\renewcommand{\leq}{\leqslant}
\renewcommand{\geq}{\geqslant}
\newcommand{\assign}{:=}
\newcommand{\backassign}{=:}
\newcommand{\precprec}{\prec\!\!\!\prec}
\newcommand{\mathd}{\mathrm{d}}
\newcommand{\tmop}[1]{\ensuremath{\operatorname{#1}}}
\newcommand{\tmrsup}[1]{\textsuperscript{#1}}
\newenvironment{proof}{\noindent\textbf{Proof\ }}{\hspace*{\fill}$\Box$\medskip}
\newtheorem{theorem}{Theorem}[section]
\newtheorem{lemma}[theorem]{Lemma}
\newtheorem{proposition}[theorem]{Proposition}
\newtheorem{corollary}[theorem]{Corollary}
{\theorembodyfont{\rmfamily}\newtheorem{remark}[theorem]{Remark}}
\numberwithin{equation}{section}
\newcommand{^{\resizebox{0.4em}{!}{
\begin{tikzpicture}
\pgfpathmoveto{\pgfqpoint{0cm}{0cm}}
\pgfpathlineto{\pgfqpoint{0.776cm}{0cm}}
\pgfpathlineto{\pgfqpoint{0.776cm}{0.953cm}}
\pgfpathlineto{\pgfqpoint{0cm}{0.953cm}}
\pgfpathclose
\pgfusepath{clip}
\begin{pgfscope}
\begin{pgfscope}
\pgfpathmoveto{\pgfqpoint{0cm}{0cm}}
\pgfpathlineto{\pgfqpoint{0.776cm}{0cm}}
\pgfpathlineto{\pgfqpoint{0.776cm}{0.953cm}}
\pgfpathlineto{\pgfqpoint{0cm}{0.953cm}}
\pgfpathclose
\pgfusepath{clip}
\begin{pgfscope}
\definecolor{eps2pgf_color}{gray}{1}\pgfsetstrokecolor{eps2pgf_color}\pgfsetfillcolor{eps2pgf_color}
\pgfpathmoveto{\pgfqpoint{0.273cm}{0.8cm}}
\pgfpathcurveto{\pgfqpoint{0.273cm}{0.837cm}}{\pgfqpoint{0.259cm}{0.871cm}}{\pgfqpoint{0.233cm}{0.897cm}}
\pgfpathcurveto{\pgfqpoint{0.207cm}{0.923cm}}{\pgfqpoint{0.173cm}{0.937cm}}{\pgfqpoint{0.137cm}{0.937cm}}
\pgfpathcurveto{\pgfqpoint{0.1cm}{0.937cm}}{\pgfqpoint{0.066cm}{0.923cm}}{\pgfqpoint{0.04cm}{0.897cm}}
\pgfpathcurveto{\pgfqpoint{0.014cm}{0.871cm}}{\pgfqpoint{0cm}{0.837cm}}{\pgfqpoint{0cm}{0.8cm}}
\pgfpathcurveto{\pgfqpoint{0cm}{0.764cm}}{\pgfqpoint{0.014cm}{0.729cm}}{\pgfqpoint{0.04cm}{0.703cm}}
\pgfpathcurveto{\pgfqpoint{0.066cm}{0.678cm}}{\pgfqpoint{0.1cm}{0.663cm}}{\pgfqpoint{0.137cm}{0.663cm}}
\pgfpathcurveto{\pgfqpoint{0.173cm}{0.663cm}}{\pgfqpoint{0.207cm}{0.678cm}}{\pgfqpoint{0.233cm}{0.703cm}}
\pgfpathcurveto{\pgfqpoint{0.259cm}{0.729cm}}{\pgfqpoint{0.273cm}{0.764cm}}{\pgfqpoint{0.273cm}{0.8cm}}
\pgfusepath{fill}
\begin{pgfscope}
\pgfsetdash{}{0cm}
\pgfsetlinewidth{0.818mm}
\pgfsetroundcap
\pgfsetmiterlimit{7.0}
\definecolor{eps2pgf_color}{gray}{0}\pgfsetstrokecolor{eps2pgf_color}\pgfsetfillcolor{eps2pgf_color}
\pgfpathmoveto{\pgfqpoint{0.249cm}{0.064cm}}
\pgfpathlineto{\pgfqpoint{0.246cm}{0.811cm}}
\pgfusepath{stroke}
\end{pgfscope}
\definecolor{eps2pgf_color}{gray}{0}\pgfsetstrokecolor{eps2pgf_color}\pgfsetfillcolor{eps2pgf_color}
\pgfpathmoveto{\pgfqpoint{0.382cm}{0.792cm}}
\pgfpathcurveto{\pgfqpoint{0.382cm}{0.828cm}}{\pgfqpoint{0.368cm}{0.863cm}}{\pgfqpoint{0.342cm}{0.889cm}}
\pgfpathcurveto{\pgfqpoint{0.317cm}{0.914cm}}{\pgfqpoint{0.282cm}{0.929cm}}{\pgfqpoint{0.246cm}{0.929cm}}
\pgfpathcurveto{\pgfqpoint{0.21cm}{0.929cm}}{\pgfqpoint{0.175cm}{0.914cm}}{\pgfqpoint{0.149cm}{0.889cm}}
\pgfpathcurveto{\pgfqpoint{0.124cm}{0.863cm}}{\pgfqpoint{0.109cm}{0.828cm}}{\pgfqpoint{0.109cm}{0.792cm}}
\pgfpathcurveto{\pgfqpoint{0.109cm}{0.755cm}}{\pgfqpoint{0.124cm}{0.721cm}}{\pgfqpoint{0.149cm}{0.695cm}}
\pgfpathcurveto{\pgfqpoint{0.175cm}{0.669cm}}{\pgfqpoint{0.21cm}{0.655cm}}{\pgfqpoint{0.246cm}{0.655cm}}
\pgfpathcurveto{\pgfqpoint{0.282cm}{0.655cm}}{\pgfqpoint{0.317cm}{0.669cm}}{\pgfqpoint{0.342cm}{0.695cm}}
\pgfpathcurveto{\pgfqpoint{0.368cm}{0.721cm}}{\pgfqpoint{0.382cm}{0.755cm}}{\pgfqpoint{0.382cm}{0.792cm}}
\pgfusepath{fill}
\definecolor{eps2pgf_color}{gray}{1}\pgfsetstrokecolor{eps2pgf_color}\pgfsetfillcolor{eps2pgf_color}
\pgfpathmoveto{\pgfqpoint{0.774cm}{0.776cm}}
\pgfpathcurveto{\pgfqpoint{0.774cm}{0.812cm}}{\pgfqpoint{0.759cm}{0.847cm}}{\pgfqpoint{0.734cm}{0.873cm}}
\pgfpathcurveto{\pgfqpoint{0.708cm}{0.899cm}}{\pgfqpoint{0.673cm}{0.913cm}}{\pgfqpoint{0.637cm}{0.913cm}}
\pgfpathcurveto{\pgfqpoint{0.601cm}{0.913cm}}{\pgfqpoint{0.566cm}{0.899cm}}{\pgfqpoint{0.541cm}{0.873cm}}
\pgfpathcurveto{\pgfqpoint{0.515cm}{0.847cm}}{\pgfqpoint{0.501cm}{0.812cm}}{\pgfqpoint{0.501cm}{0.776cm}}
\pgfpathcurveto{\pgfqpoint{0.501cm}{0.74cm}}{\pgfqpoint{0.515cm}{0.705cm}}{\pgfqpoint{0.541cm}{0.679cm}}
\pgfpathcurveto{\pgfqpoint{0.566cm}{0.654cm}}{\pgfqpoint{0.601cm}{0.639cm}}{\pgfqpoint{0.637cm}{0.639cm}}
\pgfpathcurveto{\pgfqpoint{0.673cm}{0.639cm}}{\pgfqpoint{0.708cm}{0.654cm}}{\pgfqpoint{0.734cm}{0.679cm}}
\pgfpathcurveto{\pgfqpoint{0.759cm}{0.705cm}}{\pgfqpoint{0.774cm}{0.74cm}}{\pgfqpoint{0.774cm}{0.776cm}}
\pgfusepath{fill}
\end{pgfscope}
\end{pgfscope}
\end{pgfscope}
\end{tikzpicture}}}}[1]{#1^{\resizebox{0.4em}{!}{
\begin{tikzpicture}
\pgfpathmoveto{\pgfqpoint{0cm}{0cm}}
\pgfpathlineto{\pgfqpoint{0.776cm}{0cm}}
\pgfpathlineto{\pgfqpoint{0.776cm}{0.953cm}}
\pgfpathlineto{\pgfqpoint{0cm}{0.953cm}}
\pgfpathclose
\pgfusepath{clip}
\begin{pgfscope}
\begin{pgfscope}
\pgfpathmoveto{\pgfqpoint{0cm}{0cm}}
\pgfpathlineto{\pgfqpoint{0.776cm}{0cm}}
\pgfpathlineto{\pgfqpoint{0.776cm}{0.953cm}}
\pgfpathlineto{\pgfqpoint{0cm}{0.953cm}}
\pgfpathclose
\pgfusepath{clip}
\begin{pgfscope}
\definecolor{eps2pgf_color}{gray}{1}\pgfsetstrokecolor{eps2pgf_color}\pgfsetfillcolor{eps2pgf_color}
\pgfpathmoveto{\pgfqpoint{0.273cm}{0.8cm}}
\pgfpathcurveto{\pgfqpoint{0.273cm}{0.837cm}}{\pgfqpoint{0.259cm}{0.871cm}}{\pgfqpoint{0.233cm}{0.897cm}}
\pgfpathcurveto{\pgfqpoint{0.207cm}{0.923cm}}{\pgfqpoint{0.173cm}{0.937cm}}{\pgfqpoint{0.137cm}{0.937cm}}
\pgfpathcurveto{\pgfqpoint{0.1cm}{0.937cm}}{\pgfqpoint{0.066cm}{0.923cm}}{\pgfqpoint{0.04cm}{0.897cm}}
\pgfpathcurveto{\pgfqpoint{0.014cm}{0.871cm}}{\pgfqpoint{0cm}{0.837cm}}{\pgfqpoint{0cm}{0.8cm}}
\pgfpathcurveto{\pgfqpoint{0cm}{0.764cm}}{\pgfqpoint{0.014cm}{0.729cm}}{\pgfqpoint{0.04cm}{0.703cm}}
\pgfpathcurveto{\pgfqpoint{0.066cm}{0.678cm}}{\pgfqpoint{0.1cm}{0.663cm}}{\pgfqpoint{0.137cm}{0.663cm}}
\pgfpathcurveto{\pgfqpoint{0.173cm}{0.663cm}}{\pgfqpoint{0.207cm}{0.678cm}}{\pgfqpoint{0.233cm}{0.703cm}}
\pgfpathcurveto{\pgfqpoint{0.259cm}{0.729cm}}{\pgfqpoint{0.273cm}{0.764cm}}{\pgfqpoint{0.273cm}{0.8cm}}
\pgfusepath{fill}
\begin{pgfscope}
\pgfsetdash{}{0cm}
\pgfsetlinewidth{0.818mm}
\pgfsetroundcap
\pgfsetmiterlimit{7.0}
\definecolor{eps2pgf_color}{gray}{0}\pgfsetstrokecolor{eps2pgf_color}\pgfsetfillcolor{eps2pgf_color}
\pgfpathmoveto{\pgfqpoint{0.249cm}{0.064cm}}
\pgfpathlineto{\pgfqpoint{0.246cm}{0.811cm}}
\pgfusepath{stroke}
\end{pgfscope}
\definecolor{eps2pgf_color}{gray}{0}\pgfsetstrokecolor{eps2pgf_color}\pgfsetfillcolor{eps2pgf_color}
\pgfpathmoveto{\pgfqpoint{0.382cm}{0.792cm}}
\pgfpathcurveto{\pgfqpoint{0.382cm}{0.828cm}}{\pgfqpoint{0.368cm}{0.863cm}}{\pgfqpoint{0.342cm}{0.889cm}}
\pgfpathcurveto{\pgfqpoint{0.317cm}{0.914cm}}{\pgfqpoint{0.282cm}{0.929cm}}{\pgfqpoint{0.246cm}{0.929cm}}
\pgfpathcurveto{\pgfqpoint{0.21cm}{0.929cm}}{\pgfqpoint{0.175cm}{0.914cm}}{\pgfqpoint{0.149cm}{0.889cm}}
\pgfpathcurveto{\pgfqpoint{0.124cm}{0.863cm}}{\pgfqpoint{0.109cm}{0.828cm}}{\pgfqpoint{0.109cm}{0.792cm}}
\pgfpathcurveto{\pgfqpoint{0.109cm}{0.755cm}}{\pgfqpoint{0.124cm}{0.721cm}}{\pgfqpoint{0.149cm}{0.695cm}}
\pgfpathcurveto{\pgfqpoint{0.175cm}{0.669cm}}{\pgfqpoint{0.21cm}{0.655cm}}{\pgfqpoint{0.246cm}{0.655cm}}
\pgfpathcurveto{\pgfqpoint{0.282cm}{0.655cm}}{\pgfqpoint{0.317cm}{0.669cm}}{\pgfqpoint{0.342cm}{0.695cm}}
\pgfpathcurveto{\pgfqpoint{0.368cm}{0.721cm}}{\pgfqpoint{0.382cm}{0.755cm}}{\pgfqpoint{0.382cm}{0.792cm}}
\pgfusepath{fill}
\definecolor{eps2pgf_color}{gray}{1}\pgfsetstrokecolor{eps2pgf_color}\pgfsetfillcolor{eps2pgf_color}
\pgfpathmoveto{\pgfqpoint{0.774cm}{0.776cm}}
\pgfpathcurveto{\pgfqpoint{0.774cm}{0.812cm}}{\pgfqpoint{0.759cm}{0.847cm}}{\pgfqpoint{0.734cm}{0.873cm}}
\pgfpathcurveto{\pgfqpoint{0.708cm}{0.899cm}}{\pgfqpoint{0.673cm}{0.913cm}}{\pgfqpoint{0.637cm}{0.913cm}}
\pgfpathcurveto{\pgfqpoint{0.601cm}{0.913cm}}{\pgfqpoint{0.566cm}{0.899cm}}{\pgfqpoint{0.541cm}{0.873cm}}
\pgfpathcurveto{\pgfqpoint{0.515cm}{0.847cm}}{\pgfqpoint{0.501cm}{0.812cm}}{\pgfqpoint{0.501cm}{0.776cm}}
\pgfpathcurveto{\pgfqpoint{0.501cm}{0.74cm}}{\pgfqpoint{0.515cm}{0.705cm}}{\pgfqpoint{0.541cm}{0.679cm}}
\pgfpathcurveto{\pgfqpoint{0.566cm}{0.654cm}}{\pgfqpoint{0.601cm}{0.639cm}}{\pgfqpoint{0.637cm}{0.639cm}}
\pgfpathcurveto{\pgfqpoint{0.673cm}{0.639cm}}{\pgfqpoint{0.708cm}{0.654cm}}{\pgfqpoint{0.734cm}{0.679cm}}
\pgfpathcurveto{\pgfqpoint{0.759cm}{0.705cm}}{\pgfqpoint{0.774cm}{0.74cm}}{\pgfqpoint{0.774cm}{0.776cm}}
\pgfusepath{fill}
\end{pgfscope}
\end{pgfscope}
\end{pgfscope}
\end{tikzpicture}}}}
\newcommand{^{\resizebox{0.6em}{!}{
\begin{tikzpicture}
\pgfpathmoveto{\pgfqpoint{0cm}{-0.035cm}}
\pgfpathlineto{\pgfqpoint{1.376cm}{-0.035cm}}
\pgfpathlineto{\pgfqpoint{1.376cm}{0.917cm}}
\pgfpathlineto{\pgfqpoint{0cm}{0.917cm}}
\pgfpathclose
\pgfusepath{clip}
\begin{pgfscope}
\begin{pgfscope}
\pgfpathmoveto{\pgfqpoint{0cm}{-0.035cm}}
\pgfpathlineto{\pgfqpoint{1.376cm}{-0.035cm}}
\pgfpathlineto{\pgfqpoint{1.376cm}{0.917cm}}
\pgfpathlineto{\pgfqpoint{0cm}{0.917cm}}
\pgfpathclose
\pgfusepath{clip}
\begin{pgfscope}
\begin{pgfscope}
\pgfsetdash{}{0cm}
\pgfsetlinewidth{0.818mm}
\pgfsetroundcap
\pgfsetroundjoin
\pgfsetmiterlimit{7.0}
\definecolor{eps2pgf_color}{gray}{0}\pgfsetstrokecolor{eps2pgf_color}\pgfsetfillcolor{eps2pgf_color}
\pgfpathmoveto{\pgfqpoint{0.117cm}{0.791cm}}
\pgfpathlineto{\pgfqpoint{0.682cm}{0.041cm}}
\pgfpathlineto{\pgfqpoint{1.246cm}{0.791cm}}
\pgfusepath{stroke}
\end{pgfscope}
\definecolor{eps2pgf_color}{gray}{0}\pgfsetstrokecolor{eps2pgf_color}\pgfsetfillcolor{eps2pgf_color}
\pgfpathmoveto{\pgfqpoint{0.273cm}{0.765cm}}
\pgfpathcurveto{\pgfqpoint{0.273cm}{0.801cm}}{\pgfqpoint{0.259cm}{0.836cm}}{\pgfqpoint{0.233cm}{0.862cm}}
\pgfpathcurveto{\pgfqpoint{0.207cm}{0.888cm}}{\pgfqpoint{0.173cm}{0.902cm}}{\pgfqpoint{0.137cm}{0.902cm}}
\pgfpathcurveto{\pgfqpoint{0.1cm}{0.902cm}}{\pgfqpoint{0.066cm}{0.888cm}}{\pgfqpoint{0.04cm}{0.862cm}}
\pgfpathcurveto{\pgfqpoint{0.014cm}{0.836cm}}{\pgfqpoint{0cm}{0.801cm}}{\pgfqpoint{0cm}{0.765cm}}
\pgfpathcurveto{\pgfqpoint{0cm}{0.729cm}}{\pgfqpoint{0.014cm}{0.694cm}}{\pgfqpoint{0.04cm}{0.668cm}}
\pgfpathcurveto{\pgfqpoint{0.066cm}{0.643cm}}{\pgfqpoint{0.1cm}{0.628cm}}{\pgfqpoint{0.137cm}{0.628cm}}
\pgfpathcurveto{\pgfqpoint{0.173cm}{0.628cm}}{\pgfqpoint{0.207cm}{0.643cm}}{\pgfqpoint{0.233cm}{0.668cm}}
\pgfpathcurveto{\pgfqpoint{0.259cm}{0.694cm}}{\pgfqpoint{0.273cm}{0.729cm}}{\pgfqpoint{0.273cm}{0.765cm}}
\pgfusepath{fill}
\begin{pgfscope}
\pgfsetdash{}{0cm}
\pgfsetlinewidth{0.818mm}
\pgfsetmiterlimit{7.0}
\pgfpathmoveto{\pgfqpoint{0.682cm}{0.041cm}}
\pgfpathlineto{\pgfqpoint{0.679cm}{0.788cm}}
\pgfusepath{stroke}
\end{pgfscope}
\pgfpathmoveto{\pgfqpoint{0.815cm}{0.769cm}}
\pgfpathcurveto{\pgfqpoint{0.815cm}{0.805cm}}{\pgfqpoint{0.801cm}{0.84cm}}{\pgfqpoint{0.775cm}{0.866cm}}
\pgfpathcurveto{\pgfqpoint{0.75cm}{0.891cm}}{\pgfqpoint{0.715cm}{0.906cm}}{\pgfqpoint{0.679cm}{0.906cm}}
\pgfpathcurveto{\pgfqpoint{0.643cm}{0.906cm}}{\pgfqpoint{0.608cm}{0.891cm}}{\pgfqpoint{0.582cm}{0.866cm}}
\pgfpathcurveto{\pgfqpoint{0.557cm}{0.84cm}}{\pgfqpoint{0.542cm}{0.805cm}}{\pgfqpoint{0.542cm}{0.769cm}}
\pgfpathcurveto{\pgfqpoint{0.542cm}{0.732cm}}{\pgfqpoint{0.557cm}{0.698cm}}{\pgfqpoint{0.582cm}{0.672cm}}
\pgfpathcurveto{\pgfqpoint{0.608cm}{0.646cm}}{\pgfqpoint{0.643cm}{0.632cm}}{\pgfqpoint{0.679cm}{0.632cm}}
\pgfpathcurveto{\pgfqpoint{0.715cm}{0.632cm}}{\pgfqpoint{0.75cm}{0.646cm}}{\pgfqpoint{0.775cm}{0.672cm}}
\pgfpathcurveto{\pgfqpoint{0.801cm}{0.698cm}}{\pgfqpoint{0.815cm}{0.732cm}}{\pgfqpoint{0.815cm}{0.769cm}}
\pgfusepath{fill}
\pgfpathmoveto{\pgfqpoint{1.345cm}{0.741cm}}
\pgfpathcurveto{\pgfqpoint{1.345cm}{0.777cm}}{\pgfqpoint{1.331cm}{0.812cm}}{\pgfqpoint{1.305cm}{0.838cm}}
\pgfpathcurveto{\pgfqpoint{1.28cm}{0.863cm}}{\pgfqpoint{1.245cm}{0.878cm}}{\pgfqpoint{1.209cm}{0.878cm}}
\pgfpathcurveto{\pgfqpoint{1.172cm}{0.878cm}}{\pgfqpoint{1.138cm}{0.863cm}}{\pgfqpoint{1.112cm}{0.838cm}}
\pgfpathcurveto{\pgfqpoint{1.087cm}{0.812cm}}{\pgfqpoint{1.072cm}{0.777cm}}{\pgfqpoint{1.072cm}{0.741cm}}
\pgfpathcurveto{\pgfqpoint{1.072cm}{0.704cm}}{\pgfqpoint{1.087cm}{0.67cm}}{\pgfqpoint{1.112cm}{0.644cm}}
\pgfpathcurveto{\pgfqpoint{1.138cm}{0.618cm}}{\pgfqpoint{1.172cm}{0.604cm}}{\pgfqpoint{1.209cm}{0.604cm}}
\pgfpathcurveto{\pgfqpoint{1.245cm}{0.604cm}}{\pgfqpoint{1.28cm}{0.618cm}}{\pgfqpoint{1.305cm}{0.644cm}}
\pgfpathcurveto{\pgfqpoint{1.331cm}{0.67cm}}{\pgfqpoint{1.345cm}{0.704cm}}{\pgfqpoint{1.345cm}{0.741cm}}
\pgfusepath{fill}
\end{pgfscope}
\end{pgfscope}
\end{pgfscope}
\end{tikzpicture}}}}[1]{#1^{\resizebox{0.6em}{!}{
\begin{tikzpicture}
\pgfpathmoveto{\pgfqpoint{0cm}{-0.035cm}}
\pgfpathlineto{\pgfqpoint{1.376cm}{-0.035cm}}
\pgfpathlineto{\pgfqpoint{1.376cm}{0.917cm}}
\pgfpathlineto{\pgfqpoint{0cm}{0.917cm}}
\pgfpathclose
\pgfusepath{clip}
\begin{pgfscope}
\begin{pgfscope}
\pgfpathmoveto{\pgfqpoint{0cm}{-0.035cm}}
\pgfpathlineto{\pgfqpoint{1.376cm}{-0.035cm}}
\pgfpathlineto{\pgfqpoint{1.376cm}{0.917cm}}
\pgfpathlineto{\pgfqpoint{0cm}{0.917cm}}
\pgfpathclose
\pgfusepath{clip}
\begin{pgfscope}
\begin{pgfscope}
\pgfsetdash{}{0cm}
\pgfsetlinewidth{0.818mm}
\pgfsetroundcap
\pgfsetroundjoin
\pgfsetmiterlimit{7.0}
\definecolor{eps2pgf_color}{gray}{0}\pgfsetstrokecolor{eps2pgf_color}\pgfsetfillcolor{eps2pgf_color}
\pgfpathmoveto{\pgfqpoint{0.117cm}{0.791cm}}
\pgfpathlineto{\pgfqpoint{0.682cm}{0.041cm}}
\pgfpathlineto{\pgfqpoint{1.246cm}{0.791cm}}
\pgfusepath{stroke}
\end{pgfscope}
\definecolor{eps2pgf_color}{gray}{0}\pgfsetstrokecolor{eps2pgf_color}\pgfsetfillcolor{eps2pgf_color}
\pgfpathmoveto{\pgfqpoint{0.273cm}{0.765cm}}
\pgfpathcurveto{\pgfqpoint{0.273cm}{0.801cm}}{\pgfqpoint{0.259cm}{0.836cm}}{\pgfqpoint{0.233cm}{0.862cm}}
\pgfpathcurveto{\pgfqpoint{0.207cm}{0.888cm}}{\pgfqpoint{0.173cm}{0.902cm}}{\pgfqpoint{0.137cm}{0.902cm}}
\pgfpathcurveto{\pgfqpoint{0.1cm}{0.902cm}}{\pgfqpoint{0.066cm}{0.888cm}}{\pgfqpoint{0.04cm}{0.862cm}}
\pgfpathcurveto{\pgfqpoint{0.014cm}{0.836cm}}{\pgfqpoint{0cm}{0.801cm}}{\pgfqpoint{0cm}{0.765cm}}
\pgfpathcurveto{\pgfqpoint{0cm}{0.729cm}}{\pgfqpoint{0.014cm}{0.694cm}}{\pgfqpoint{0.04cm}{0.668cm}}
\pgfpathcurveto{\pgfqpoint{0.066cm}{0.643cm}}{\pgfqpoint{0.1cm}{0.628cm}}{\pgfqpoint{0.137cm}{0.628cm}}
\pgfpathcurveto{\pgfqpoint{0.173cm}{0.628cm}}{\pgfqpoint{0.207cm}{0.643cm}}{\pgfqpoint{0.233cm}{0.668cm}}
\pgfpathcurveto{\pgfqpoint{0.259cm}{0.694cm}}{\pgfqpoint{0.273cm}{0.729cm}}{\pgfqpoint{0.273cm}{0.765cm}}
\pgfusepath{fill}
\begin{pgfscope}
\pgfsetdash{}{0cm}
\pgfsetlinewidth{0.818mm}
\pgfsetmiterlimit{7.0}
\pgfpathmoveto{\pgfqpoint{0.682cm}{0.041cm}}
\pgfpathlineto{\pgfqpoint{0.679cm}{0.788cm}}
\pgfusepath{stroke}
\end{pgfscope}
\pgfpathmoveto{\pgfqpoint{0.815cm}{0.769cm}}
\pgfpathcurveto{\pgfqpoint{0.815cm}{0.805cm}}{\pgfqpoint{0.801cm}{0.84cm}}{\pgfqpoint{0.775cm}{0.866cm}}
\pgfpathcurveto{\pgfqpoint{0.75cm}{0.891cm}}{\pgfqpoint{0.715cm}{0.906cm}}{\pgfqpoint{0.679cm}{0.906cm}}
\pgfpathcurveto{\pgfqpoint{0.643cm}{0.906cm}}{\pgfqpoint{0.608cm}{0.891cm}}{\pgfqpoint{0.582cm}{0.866cm}}
\pgfpathcurveto{\pgfqpoint{0.557cm}{0.84cm}}{\pgfqpoint{0.542cm}{0.805cm}}{\pgfqpoint{0.542cm}{0.769cm}}
\pgfpathcurveto{\pgfqpoint{0.542cm}{0.732cm}}{\pgfqpoint{0.557cm}{0.698cm}}{\pgfqpoint{0.582cm}{0.672cm}}
\pgfpathcurveto{\pgfqpoint{0.608cm}{0.646cm}}{\pgfqpoint{0.643cm}{0.632cm}}{\pgfqpoint{0.679cm}{0.632cm}}
\pgfpathcurveto{\pgfqpoint{0.715cm}{0.632cm}}{\pgfqpoint{0.75cm}{0.646cm}}{\pgfqpoint{0.775cm}{0.672cm}}
\pgfpathcurveto{\pgfqpoint{0.801cm}{0.698cm}}{\pgfqpoint{0.815cm}{0.732cm}}{\pgfqpoint{0.815cm}{0.769cm}}
\pgfusepath{fill}
\pgfpathmoveto{\pgfqpoint{1.345cm}{0.741cm}}
\pgfpathcurveto{\pgfqpoint{1.345cm}{0.777cm}}{\pgfqpoint{1.331cm}{0.812cm}}{\pgfqpoint{1.305cm}{0.838cm}}
\pgfpathcurveto{\pgfqpoint{1.28cm}{0.863cm}}{\pgfqpoint{1.245cm}{0.878cm}}{\pgfqpoint{1.209cm}{0.878cm}}
\pgfpathcurveto{\pgfqpoint{1.172cm}{0.878cm}}{\pgfqpoint{1.138cm}{0.863cm}}{\pgfqpoint{1.112cm}{0.838cm}}
\pgfpathcurveto{\pgfqpoint{1.087cm}{0.812cm}}{\pgfqpoint{1.072cm}{0.777cm}}{\pgfqpoint{1.072cm}{0.741cm}}
\pgfpathcurveto{\pgfqpoint{1.072cm}{0.704cm}}{\pgfqpoint{1.087cm}{0.67cm}}{\pgfqpoint{1.112cm}{0.644cm}}
\pgfpathcurveto{\pgfqpoint{1.138cm}{0.618cm}}{\pgfqpoint{1.172cm}{0.604cm}}{\pgfqpoint{1.209cm}{0.604cm}}
\pgfpathcurveto{\pgfqpoint{1.245cm}{0.604cm}}{\pgfqpoint{1.28cm}{0.618cm}}{\pgfqpoint{1.305cm}{0.644cm}}
\pgfpathcurveto{\pgfqpoint{1.331cm}{0.67cm}}{\pgfqpoint{1.345cm}{0.704cm}}{\pgfqpoint{1.345cm}{0.741cm}}
\pgfusepath{fill}
\end{pgfscope}
\end{pgfscope}
\end{pgfscope}
\end{tikzpicture}}}}
\newcommand{^{\!\resizebox{0.6em}{!}{
\begin{tikzpicture}
\pgfpathmoveto{\pgfqpoint{0cm}{-0.035cm}}
\pgfpathlineto{\pgfqpoint{1.376cm}{-0.035cm}}
\pgfpathlineto{\pgfqpoint{1.376cm}{0.917cm}}
\pgfpathlineto{\pgfqpoint{0cm}{0.917cm}}
\pgfpathclose
\pgfusepath{clip}
\begin{pgfscope}
\begin{pgfscope}
\pgfpathmoveto{\pgfqpoint{0cm}{-0.035cm}}
\pgfpathlineto{\pgfqpoint{1.376cm}{-0.035cm}}
\pgfpathlineto{\pgfqpoint{1.376cm}{0.917cm}}
\pgfpathlineto{\pgfqpoint{0cm}{0.917cm}}
\pgfpathclose
\pgfusepath{clip}
\begin{pgfscope}
\begin{pgfscope}
\pgfsetdash{}{0cm}
\pgfsetlinewidth{0.818mm}
\pgfsetroundcap
\pgfsetroundjoin
\pgfsetmiterlimit{7.0}
\definecolor{eps2pgf_color}{gray}{0}\pgfsetstrokecolor{eps2pgf_color}\pgfsetfillcolor{eps2pgf_color}
\pgfpathmoveto{\pgfqpoint{0.117cm}{0.791cm}}
\pgfpathlineto{\pgfqpoint{0.682cm}{0.041cm}}
\pgfpathlineto{\pgfqpoint{1.246cm}{0.791cm}}
\pgfusepath{stroke}
\end{pgfscope}
\definecolor{eps2pgf_color}{gray}{0}\pgfsetstrokecolor{eps2pgf_color}\pgfsetfillcolor{eps2pgf_color}
\pgfpathmoveto{\pgfqpoint{0.273cm}{0.765cm}}
\pgfpathcurveto{\pgfqpoint{0.273cm}{0.801cm}}{\pgfqpoint{0.259cm}{0.836cm}}{\pgfqpoint{0.233cm}{0.862cm}}
\pgfpathcurveto{\pgfqpoint{0.207cm}{0.888cm}}{\pgfqpoint{0.173cm}{0.902cm}}{\pgfqpoint{0.137cm}{0.902cm}}
\pgfpathcurveto{\pgfqpoint{0.1cm}{0.902cm}}{\pgfqpoint{0.066cm}{0.888cm}}{\pgfqpoint{0.04cm}{0.862cm}}
\pgfpathcurveto{\pgfqpoint{0.014cm}{0.836cm}}{\pgfqpoint{0cm}{0.801cm}}{\pgfqpoint{0cm}{0.765cm}}
\pgfpathcurveto{\pgfqpoint{0cm}{0.729cm}}{\pgfqpoint{0.014cm}{0.694cm}}{\pgfqpoint{0.04cm}{0.668cm}}
\pgfpathcurveto{\pgfqpoint{0.066cm}{0.643cm}}{\pgfqpoint{0.1cm}{0.628cm}}{\pgfqpoint{0.137cm}{0.628cm}}
\pgfpathcurveto{\pgfqpoint{0.173cm}{0.628cm}}{\pgfqpoint{0.207cm}{0.643cm}}{\pgfqpoint{0.233cm}{0.668cm}}
\pgfpathcurveto{\pgfqpoint{0.259cm}{0.694cm}}{\pgfqpoint{0.273cm}{0.729cm}}{\pgfqpoint{0.273cm}{0.765cm}}
\pgfusepath{fill}
\pgfpathmoveto{\pgfqpoint{1.345cm}{0.741cm}}
\pgfpathcurveto{\pgfqpoint{1.345cm}{0.777cm}}{\pgfqpoint{1.331cm}{0.812cm}}{\pgfqpoint{1.305cm}{0.838cm}}
\pgfpathcurveto{\pgfqpoint{1.28cm}{0.863cm}}{\pgfqpoint{1.245cm}{0.878cm}}{\pgfqpoint{1.209cm}{0.878cm}}
\pgfpathcurveto{\pgfqpoint{1.172cm}{0.878cm}}{\pgfqpoint{1.138cm}{0.863cm}}{\pgfqpoint{1.112cm}{0.838cm}}
\pgfpathcurveto{\pgfqpoint{1.087cm}{0.812cm}}{\pgfqpoint{1.072cm}{0.777cm}}{\pgfqpoint{1.072cm}{0.741cm}}
\pgfpathcurveto{\pgfqpoint{1.072cm}{0.704cm}}{\pgfqpoint{1.087cm}{0.67cm}}{\pgfqpoint{1.112cm}{0.644cm}}
\pgfpathcurveto{\pgfqpoint{1.138cm}{0.618cm}}{\pgfqpoint{1.172cm}{0.604cm}}{\pgfqpoint{1.209cm}{0.604cm}}
\pgfpathcurveto{\pgfqpoint{1.245cm}{0.604cm}}{\pgfqpoint{1.28cm}{0.618cm}}{\pgfqpoint{1.305cm}{0.644cm}}
\pgfpathcurveto{\pgfqpoint{1.331cm}{0.67cm}}{\pgfqpoint{1.345cm}{0.704cm}}{\pgfqpoint{1.345cm}{0.741cm}}
\pgfusepath{fill}
\end{pgfscope}
\end{pgfscope}
\end{pgfscope}
\end{tikzpicture}}}}[1]{#1^{\!\resizebox{0.6em}{!}{
\begin{tikzpicture}
\pgfpathmoveto{\pgfqpoint{0cm}{-0.035cm}}
\pgfpathlineto{\pgfqpoint{1.376cm}{-0.035cm}}
\pgfpathlineto{\pgfqpoint{1.376cm}{0.917cm}}
\pgfpathlineto{\pgfqpoint{0cm}{0.917cm}}
\pgfpathclose
\pgfusepath{clip}
\begin{pgfscope}
\begin{pgfscope}
\pgfpathmoveto{\pgfqpoint{0cm}{-0.035cm}}
\pgfpathlineto{\pgfqpoint{1.376cm}{-0.035cm}}
\pgfpathlineto{\pgfqpoint{1.376cm}{0.917cm}}
\pgfpathlineto{\pgfqpoint{0cm}{0.917cm}}
\pgfpathclose
\pgfusepath{clip}
\begin{pgfscope}
\begin{pgfscope}
\pgfsetdash{}{0cm}
\pgfsetlinewidth{0.818mm}
\pgfsetroundcap
\pgfsetroundjoin
\pgfsetmiterlimit{7.0}
\definecolor{eps2pgf_color}{gray}{0}\pgfsetstrokecolor{eps2pgf_color}\pgfsetfillcolor{eps2pgf_color}
\pgfpathmoveto{\pgfqpoint{0.117cm}{0.791cm}}
\pgfpathlineto{\pgfqpoint{0.682cm}{0.041cm}}
\pgfpathlineto{\pgfqpoint{1.246cm}{0.791cm}}
\pgfusepath{stroke}
\end{pgfscope}
\definecolor{eps2pgf_color}{gray}{0}\pgfsetstrokecolor{eps2pgf_color}\pgfsetfillcolor{eps2pgf_color}
\pgfpathmoveto{\pgfqpoint{0.273cm}{0.765cm}}
\pgfpathcurveto{\pgfqpoint{0.273cm}{0.801cm}}{\pgfqpoint{0.259cm}{0.836cm}}{\pgfqpoint{0.233cm}{0.862cm}}
\pgfpathcurveto{\pgfqpoint{0.207cm}{0.888cm}}{\pgfqpoint{0.173cm}{0.902cm}}{\pgfqpoint{0.137cm}{0.902cm}}
\pgfpathcurveto{\pgfqpoint{0.1cm}{0.902cm}}{\pgfqpoint{0.066cm}{0.888cm}}{\pgfqpoint{0.04cm}{0.862cm}}
\pgfpathcurveto{\pgfqpoint{0.014cm}{0.836cm}}{\pgfqpoint{0cm}{0.801cm}}{\pgfqpoint{0cm}{0.765cm}}
\pgfpathcurveto{\pgfqpoint{0cm}{0.729cm}}{\pgfqpoint{0.014cm}{0.694cm}}{\pgfqpoint{0.04cm}{0.668cm}}
\pgfpathcurveto{\pgfqpoint{0.066cm}{0.643cm}}{\pgfqpoint{0.1cm}{0.628cm}}{\pgfqpoint{0.137cm}{0.628cm}}
\pgfpathcurveto{\pgfqpoint{0.173cm}{0.628cm}}{\pgfqpoint{0.207cm}{0.643cm}}{\pgfqpoint{0.233cm}{0.668cm}}
\pgfpathcurveto{\pgfqpoint{0.259cm}{0.694cm}}{\pgfqpoint{0.273cm}{0.729cm}}{\pgfqpoint{0.273cm}{0.765cm}}
\pgfusepath{fill}
\pgfpathmoveto{\pgfqpoint{1.345cm}{0.741cm}}
\pgfpathcurveto{\pgfqpoint{1.345cm}{0.777cm}}{\pgfqpoint{1.331cm}{0.812cm}}{\pgfqpoint{1.305cm}{0.838cm}}
\pgfpathcurveto{\pgfqpoint{1.28cm}{0.863cm}}{\pgfqpoint{1.245cm}{0.878cm}}{\pgfqpoint{1.209cm}{0.878cm}}
\pgfpathcurveto{\pgfqpoint{1.172cm}{0.878cm}}{\pgfqpoint{1.138cm}{0.863cm}}{\pgfqpoint{1.112cm}{0.838cm}}
\pgfpathcurveto{\pgfqpoint{1.087cm}{0.812cm}}{\pgfqpoint{1.072cm}{0.777cm}}{\pgfqpoint{1.072cm}{0.741cm}}
\pgfpathcurveto{\pgfqpoint{1.072cm}{0.704cm}}{\pgfqpoint{1.087cm}{0.67cm}}{\pgfqpoint{1.112cm}{0.644cm}}
\pgfpathcurveto{\pgfqpoint{1.138cm}{0.618cm}}{\pgfqpoint{1.172cm}{0.604cm}}{\pgfqpoint{1.209cm}{0.604cm}}
\pgfpathcurveto{\pgfqpoint{1.245cm}{0.604cm}}{\pgfqpoint{1.28cm}{0.618cm}}{\pgfqpoint{1.305cm}{0.644cm}}
\pgfpathcurveto{\pgfqpoint{1.331cm}{0.67cm}}{\pgfqpoint{1.345cm}{0.704cm}}{\pgfqpoint{1.345cm}{0.741cm}}
\pgfusepath{fill}
\end{pgfscope}
\end{pgfscope}
\end{pgfscope}
\end{tikzpicture}}}}
\newcommand{^{\!\resizebox{0.6em}{!}{
\begin{tikzpicture}
\pgfpathmoveto{\pgfqpoint{0cm}{-0.035cm}}
\pgfpathlineto{\pgfqpoint{1.376cm}{-0.035cm}}
\pgfpathlineto{\pgfqpoint{1.376cm}{1.552cm}}
\pgfpathlineto{\pgfqpoint{0cm}{1.552cm}}
\pgfpathclose
\pgfusepath{clip}
\begin{pgfscope}
\begin{pgfscope}
\pgfpathmoveto{\pgfqpoint{0cm}{-0.035cm}}
\pgfpathlineto{\pgfqpoint{1.376cm}{-0.035cm}}
\pgfpathlineto{\pgfqpoint{1.376cm}{1.552cm}}
\pgfpathlineto{\pgfqpoint{0cm}{1.552cm}}
\pgfpathclose
\pgfusepath{clip}
\begin{pgfscope}
\begin{pgfscope}
\pgfsetdash{}{0cm}
\pgfsetlinewidth{0.818mm}
\pgfsetroundcap
\pgfsetroundjoin
\pgfsetmiterlimit{7.0}
\definecolor{eps2pgf_color}{gray}{0}\pgfsetstrokecolor{eps2pgf_color}\pgfsetfillcolor{eps2pgf_color}
\pgfpathmoveto{\pgfqpoint{0.117cm}{1.421cm}}
\pgfpathlineto{\pgfqpoint{0.682cm}{0.671cm}}
\pgfpathlineto{\pgfqpoint{1.246cm}{1.421cm}}
\pgfusepath{stroke}
\end{pgfscope}
\definecolor{eps2pgf_color}{gray}{0}\pgfsetstrokecolor{eps2pgf_color}\pgfsetfillcolor{eps2pgf_color}
\pgfpathmoveto{\pgfqpoint{0.273cm}{1.395cm}}
\pgfpathcurveto{\pgfqpoint{0.273cm}{1.432cm}}{\pgfqpoint{0.259cm}{1.467cm}}{\pgfqpoint{0.233cm}{1.492cm}}
\pgfpathcurveto{\pgfqpoint{0.207cm}{1.518cm}}{\pgfqpoint{0.173cm}{1.532cm}}{\pgfqpoint{0.137cm}{1.532cm}}
\pgfpathcurveto{\pgfqpoint{0.1cm}{1.532cm}}{\pgfqpoint{0.066cm}{1.518cm}}{\pgfqpoint{0.04cm}{1.492cm}}
\pgfpathcurveto{\pgfqpoint{0.014cm}{1.467cm}}{\pgfqpoint{0cm}{1.432cm}}{\pgfqpoint{0cm}{1.395cm}}
\pgfpathcurveto{\pgfqpoint{0cm}{1.359cm}}{\pgfqpoint{0.014cm}{1.324cm}}{\pgfqpoint{0.04cm}{1.299cm}}
\pgfpathcurveto{\pgfqpoint{0.066cm}{1.273cm}}{\pgfqpoint{0.1cm}{1.258cm}}{\pgfqpoint{0.137cm}{1.258cm}}
\pgfpathcurveto{\pgfqpoint{0.173cm}{1.258cm}}{\pgfqpoint{0.207cm}{1.273cm}}{\pgfqpoint{0.233cm}{1.299cm}}
\pgfpathcurveto{\pgfqpoint{0.259cm}{1.324cm}}{\pgfqpoint{0.273cm}{1.359cm}}{\pgfqpoint{0.273cm}{1.395cm}}
\pgfusepath{fill}
\begin{pgfscope}
\pgfsetdash{}{0cm}
\pgfsetlinewidth{0.818mm}
\pgfsetmiterlimit{7.0}
\pgfpathmoveto{\pgfqpoint{0.682cm}{0.671cm}}
\pgfpathlineto{\pgfqpoint{0.679cm}{1.418cm}}
\pgfusepath{stroke}
\end{pgfscope}
\pgfpathmoveto{\pgfqpoint{0.815cm}{1.399cm}}
\pgfpathcurveto{\pgfqpoint{0.815cm}{1.435cm}}{\pgfqpoint{0.801cm}{1.47cm}}{\pgfqpoint{0.775cm}{1.496cm}}
\pgfpathcurveto{\pgfqpoint{0.75cm}{1.521cm}}{\pgfqpoint{0.715cm}{1.536cm}}{\pgfqpoint{0.679cm}{1.536cm}}
\pgfpathcurveto{\pgfqpoint{0.643cm}{1.536cm}}{\pgfqpoint{0.608cm}{1.521cm}}{\pgfqpoint{0.582cm}{1.496cm}}
\pgfpathcurveto{\pgfqpoint{0.557cm}{1.47cm}}{\pgfqpoint{0.542cm}{1.435cm}}{\pgfqpoint{0.542cm}{1.399cm}}
\pgfpathcurveto{\pgfqpoint{0.542cm}{1.363cm}}{\pgfqpoint{0.557cm}{1.328cm}}{\pgfqpoint{0.582cm}{1.302cm}}
\pgfpathcurveto{\pgfqpoint{0.608cm}{1.276cm}}{\pgfqpoint{0.643cm}{1.262cm}}{\pgfqpoint{0.679cm}{1.262cm}}
\pgfpathcurveto{\pgfqpoint{0.715cm}{1.262cm}}{\pgfqpoint{0.75cm}{1.276cm}}{\pgfqpoint{0.775cm}{1.302cm}}
\pgfpathcurveto{\pgfqpoint{0.801cm}{1.328cm}}{\pgfqpoint{0.815cm}{1.363cm}}{\pgfqpoint{0.815cm}{1.399cm}}
\pgfusepath{fill}
\pgfpathmoveto{\pgfqpoint{1.345cm}{1.371cm}}
\pgfpathcurveto{\pgfqpoint{1.345cm}{1.408cm}}{\pgfqpoint{1.331cm}{1.442cm}}{\pgfqpoint{1.305cm}{1.468cm}}
\pgfpathcurveto{\pgfqpoint{1.28cm}{1.494cm}}{\pgfqpoint{1.245cm}{1.508cm}}{\pgfqpoint{1.209cm}{1.508cm}}
\pgfpathcurveto{\pgfqpoint{1.172cm}{1.508cm}}{\pgfqpoint{1.138cm}{1.494cm}}{\pgfqpoint{1.112cm}{1.468cm}}
\pgfpathcurveto{\pgfqpoint{1.087cm}{1.442cm}}{\pgfqpoint{1.072cm}{1.408cm}}{\pgfqpoint{1.072cm}{1.371cm}}
\pgfpathcurveto{\pgfqpoint{1.072cm}{1.335cm}}{\pgfqpoint{1.087cm}{1.3cm}}{\pgfqpoint{1.112cm}{1.274cm}}
\pgfpathcurveto{\pgfqpoint{1.138cm}{1.249cm}}{\pgfqpoint{1.172cm}{1.234cm}}{\pgfqpoint{1.209cm}{1.234cm}}
\pgfpathcurveto{\pgfqpoint{1.245cm}{1.234cm}}{\pgfqpoint{1.28cm}{1.249cm}}{\pgfqpoint{1.305cm}{1.274cm}}
\pgfpathcurveto{\pgfqpoint{1.331cm}{1.3cm}}{\pgfqpoint{1.345cm}{1.335cm}}{\pgfqpoint{1.345cm}{1.371cm}}
\pgfusepath{fill}
\begin{pgfscope}
\pgfsetdash{}{0cm}
\pgfsetlinewidth{0.818mm}
\pgfsetroundcap
\pgfsetmiterlimit{4.0}
\pgfpathmoveto{\pgfqpoint{0.682cm}{0.671cm}}
\pgfpathlineto{\pgfqpoint{0.682cm}{0.042cm}}
\pgfusepath{stroke}
\end{pgfscope}
\end{pgfscope}
\end{pgfscope}
\end{pgfscope}
\end{tikzpicture}}}}[1]{#1^{\!\resizebox{0.6em}{!}{
\begin{tikzpicture}
\pgfpathmoveto{\pgfqpoint{0cm}{-0.035cm}}
\pgfpathlineto{\pgfqpoint{1.376cm}{-0.035cm}}
\pgfpathlineto{\pgfqpoint{1.376cm}{1.552cm}}
\pgfpathlineto{\pgfqpoint{0cm}{1.552cm}}
\pgfpathclose
\pgfusepath{clip}
\begin{pgfscope}
\begin{pgfscope}
\pgfpathmoveto{\pgfqpoint{0cm}{-0.035cm}}
\pgfpathlineto{\pgfqpoint{1.376cm}{-0.035cm}}
\pgfpathlineto{\pgfqpoint{1.376cm}{1.552cm}}
\pgfpathlineto{\pgfqpoint{0cm}{1.552cm}}
\pgfpathclose
\pgfusepath{clip}
\begin{pgfscope}
\begin{pgfscope}
\pgfsetdash{}{0cm}
\pgfsetlinewidth{0.818mm}
\pgfsetroundcap
\pgfsetroundjoin
\pgfsetmiterlimit{7.0}
\definecolor{eps2pgf_color}{gray}{0}\pgfsetstrokecolor{eps2pgf_color}\pgfsetfillcolor{eps2pgf_color}
\pgfpathmoveto{\pgfqpoint{0.117cm}{1.421cm}}
\pgfpathlineto{\pgfqpoint{0.682cm}{0.671cm}}
\pgfpathlineto{\pgfqpoint{1.246cm}{1.421cm}}
\pgfusepath{stroke}
\end{pgfscope}
\definecolor{eps2pgf_color}{gray}{0}\pgfsetstrokecolor{eps2pgf_color}\pgfsetfillcolor{eps2pgf_color}
\pgfpathmoveto{\pgfqpoint{0.273cm}{1.395cm}}
\pgfpathcurveto{\pgfqpoint{0.273cm}{1.432cm}}{\pgfqpoint{0.259cm}{1.467cm}}{\pgfqpoint{0.233cm}{1.492cm}}
\pgfpathcurveto{\pgfqpoint{0.207cm}{1.518cm}}{\pgfqpoint{0.173cm}{1.532cm}}{\pgfqpoint{0.137cm}{1.532cm}}
\pgfpathcurveto{\pgfqpoint{0.1cm}{1.532cm}}{\pgfqpoint{0.066cm}{1.518cm}}{\pgfqpoint{0.04cm}{1.492cm}}
\pgfpathcurveto{\pgfqpoint{0.014cm}{1.467cm}}{\pgfqpoint{0cm}{1.432cm}}{\pgfqpoint{0cm}{1.395cm}}
\pgfpathcurveto{\pgfqpoint{0cm}{1.359cm}}{\pgfqpoint{0.014cm}{1.324cm}}{\pgfqpoint{0.04cm}{1.299cm}}
\pgfpathcurveto{\pgfqpoint{0.066cm}{1.273cm}}{\pgfqpoint{0.1cm}{1.258cm}}{\pgfqpoint{0.137cm}{1.258cm}}
\pgfpathcurveto{\pgfqpoint{0.173cm}{1.258cm}}{\pgfqpoint{0.207cm}{1.273cm}}{\pgfqpoint{0.233cm}{1.299cm}}
\pgfpathcurveto{\pgfqpoint{0.259cm}{1.324cm}}{\pgfqpoint{0.273cm}{1.359cm}}{\pgfqpoint{0.273cm}{1.395cm}}
\pgfusepath{fill}
\begin{pgfscope}
\pgfsetdash{}{0cm}
\pgfsetlinewidth{0.818mm}
\pgfsetmiterlimit{7.0}
\pgfpathmoveto{\pgfqpoint{0.682cm}{0.671cm}}
\pgfpathlineto{\pgfqpoint{0.679cm}{1.418cm}}
\pgfusepath{stroke}
\end{pgfscope}
\pgfpathmoveto{\pgfqpoint{0.815cm}{1.399cm}}
\pgfpathcurveto{\pgfqpoint{0.815cm}{1.435cm}}{\pgfqpoint{0.801cm}{1.47cm}}{\pgfqpoint{0.775cm}{1.496cm}}
\pgfpathcurveto{\pgfqpoint{0.75cm}{1.521cm}}{\pgfqpoint{0.715cm}{1.536cm}}{\pgfqpoint{0.679cm}{1.536cm}}
\pgfpathcurveto{\pgfqpoint{0.643cm}{1.536cm}}{\pgfqpoint{0.608cm}{1.521cm}}{\pgfqpoint{0.582cm}{1.496cm}}
\pgfpathcurveto{\pgfqpoint{0.557cm}{1.47cm}}{\pgfqpoint{0.542cm}{1.435cm}}{\pgfqpoint{0.542cm}{1.399cm}}
\pgfpathcurveto{\pgfqpoint{0.542cm}{1.363cm}}{\pgfqpoint{0.557cm}{1.328cm}}{\pgfqpoint{0.582cm}{1.302cm}}
\pgfpathcurveto{\pgfqpoint{0.608cm}{1.276cm}}{\pgfqpoint{0.643cm}{1.262cm}}{\pgfqpoint{0.679cm}{1.262cm}}
\pgfpathcurveto{\pgfqpoint{0.715cm}{1.262cm}}{\pgfqpoint{0.75cm}{1.276cm}}{\pgfqpoint{0.775cm}{1.302cm}}
\pgfpathcurveto{\pgfqpoint{0.801cm}{1.328cm}}{\pgfqpoint{0.815cm}{1.363cm}}{\pgfqpoint{0.815cm}{1.399cm}}
\pgfusepath{fill}
\pgfpathmoveto{\pgfqpoint{1.345cm}{1.371cm}}
\pgfpathcurveto{\pgfqpoint{1.345cm}{1.408cm}}{\pgfqpoint{1.331cm}{1.442cm}}{\pgfqpoint{1.305cm}{1.468cm}}
\pgfpathcurveto{\pgfqpoint{1.28cm}{1.494cm}}{\pgfqpoint{1.245cm}{1.508cm}}{\pgfqpoint{1.209cm}{1.508cm}}
\pgfpathcurveto{\pgfqpoint{1.172cm}{1.508cm}}{\pgfqpoint{1.138cm}{1.494cm}}{\pgfqpoint{1.112cm}{1.468cm}}
\pgfpathcurveto{\pgfqpoint{1.087cm}{1.442cm}}{\pgfqpoint{1.072cm}{1.408cm}}{\pgfqpoint{1.072cm}{1.371cm}}
\pgfpathcurveto{\pgfqpoint{1.072cm}{1.335cm}}{\pgfqpoint{1.087cm}{1.3cm}}{\pgfqpoint{1.112cm}{1.274cm}}
\pgfpathcurveto{\pgfqpoint{1.138cm}{1.249cm}}{\pgfqpoint{1.172cm}{1.234cm}}{\pgfqpoint{1.209cm}{1.234cm}}
\pgfpathcurveto{\pgfqpoint{1.245cm}{1.234cm}}{\pgfqpoint{1.28cm}{1.249cm}}{\pgfqpoint{1.305cm}{1.274cm}}
\pgfpathcurveto{\pgfqpoint{1.331cm}{1.3cm}}{\pgfqpoint{1.345cm}{1.335cm}}{\pgfqpoint{1.345cm}{1.371cm}}
\pgfusepath{fill}
\begin{pgfscope}
\pgfsetdash{}{0cm}
\pgfsetlinewidth{0.818mm}
\pgfsetroundcap
\pgfsetmiterlimit{4.0}
\pgfpathmoveto{\pgfqpoint{0.682cm}{0.671cm}}
\pgfpathlineto{\pgfqpoint{0.682cm}{0.042cm}}
\pgfusepath{stroke}
\end{pgfscope}
\end{pgfscope}
\end{pgfscope}
\end{pgfscope}
\end{tikzpicture}}}}
\newcommand{^{\!\resizebox{0.6em}{!}{
\begin{tikzpicture}
\pgfpathmoveto{\pgfqpoint{0cm}{0cm}}
\pgfpathlineto{\pgfqpoint{1.376cm}{0cm}}
\pgfpathlineto{\pgfqpoint{1.376cm}{1.588cm}}
\pgfpathlineto{\pgfqpoint{0cm}{1.588cm}}
\pgfpathclose
\pgfusepath{clip}
\begin{pgfscope}
\begin{pgfscope}
\pgfpathmoveto{\pgfqpoint{0cm}{0cm}}
\pgfpathlineto{\pgfqpoint{1.376cm}{0cm}}
\pgfpathlineto{\pgfqpoint{1.376cm}{1.588cm}}
\pgfpathlineto{\pgfqpoint{0cm}{1.588cm}}
\pgfpathclose
\pgfusepath{clip}
\begin{pgfscope}
\begin{pgfscope}
\definecolor{eps2pgf_color}{gray}{0.976471}\pgfsetstrokecolor{eps2pgf_color}\pgfsetfillcolor{eps2pgf_color}
\pgfpathmoveto{\pgfqpoint{0cm}{0cm}}
\pgfpathlineto{\pgfqpoint{1.376cm}{0cm}}
\pgfpathlineto{\pgfqpoint{1.376cm}{1.588cm}}
\pgfpathlineto{\pgfqpoint{0cm}{1.588cm}}
\pgfpathclose
\pgfusepath{fill}
\end{pgfscope}
\begin{pgfscope}
\pgfsetdash{}{0cm}
\pgfsetlinewidth{0.818mm}
\pgfsetroundcap
\pgfsetroundjoin
\pgfsetmiterlimit{7.0}
\definecolor{eps2pgf_color}{gray}{0}\pgfsetstrokecolor{eps2pgf_color}\pgfsetfillcolor{eps2pgf_color}
\pgfpathmoveto{\pgfqpoint{0.117cm}{1.476cm}}
\pgfpathlineto{\pgfqpoint{0.682cm}{0.726cm}}
\pgfpathlineto{\pgfqpoint{1.246cm}{1.476cm}}
\pgfusepath{stroke}
\end{pgfscope}
\definecolor{eps2pgf_color}{gray}{0}\pgfsetstrokecolor{eps2pgf_color}\pgfsetfillcolor{eps2pgf_color}
\pgfpathmoveto{\pgfqpoint{0.273cm}{1.451cm}}
\pgfpathcurveto{\pgfqpoint{0.273cm}{1.487cm}}{\pgfqpoint{0.259cm}{1.522cm}}{\pgfqpoint{0.233cm}{1.547cm}}
\pgfpathcurveto{\pgfqpoint{0.207cm}{1.573cm}}{\pgfqpoint{0.173cm}{1.588cm}}{\pgfqpoint{0.137cm}{1.588cm}}
\pgfpathcurveto{\pgfqpoint{0.1cm}{1.588cm}}{\pgfqpoint{0.066cm}{1.573cm}}{\pgfqpoint{0.04cm}{1.547cm}}
\pgfpathcurveto{\pgfqpoint{0.014cm}{1.522cm}}{\pgfqpoint{0cm}{1.487cm}}{\pgfqpoint{0cm}{1.451cm}}
\pgfpathcurveto{\pgfqpoint{0cm}{1.414cm}}{\pgfqpoint{0.014cm}{1.379cm}}{\pgfqpoint{0.04cm}{1.354cm}}
\pgfpathcurveto{\pgfqpoint{0.066cm}{1.328cm}}{\pgfqpoint{0.1cm}{1.314cm}}{\pgfqpoint{0.137cm}{1.314cm}}
\pgfpathcurveto{\pgfqpoint{0.173cm}{1.314cm}}{\pgfqpoint{0.207cm}{1.328cm}}{\pgfqpoint{0.233cm}{1.354cm}}
\pgfpathcurveto{\pgfqpoint{0.259cm}{1.379cm}}{\pgfqpoint{0.273cm}{1.414cm}}{\pgfqpoint{0.273cm}{1.451cm}}
\pgfusepath{fill}
\pgfpathmoveto{\pgfqpoint{1.345cm}{1.426cm}}
\pgfpathcurveto{\pgfqpoint{1.345cm}{1.463cm}}{\pgfqpoint{1.331cm}{1.497cm}}{\pgfqpoint{1.305cm}{1.523cm}}
\pgfpathcurveto{\pgfqpoint{1.28cm}{1.549cm}}{\pgfqpoint{1.245cm}{1.563cm}}{\pgfqpoint{1.209cm}{1.563cm}}
\pgfpathcurveto{\pgfqpoint{1.172cm}{1.563cm}}{\pgfqpoint{1.138cm}{1.549cm}}{\pgfqpoint{1.112cm}{1.523cm}}
\pgfpathcurveto{\pgfqpoint{1.087cm}{1.497cm}}{\pgfqpoint{1.072cm}{1.463cm}}{\pgfqpoint{1.072cm}{1.426cm}}
\pgfpathcurveto{\pgfqpoint{1.072cm}{1.39cm}}{\pgfqpoint{1.087cm}{1.355cm}}{\pgfqpoint{1.112cm}{1.329cm}}
\pgfpathcurveto{\pgfqpoint{1.138cm}{1.304cm}}{\pgfqpoint{1.172cm}{1.289cm}}{\pgfqpoint{1.209cm}{1.289cm}}
\pgfpathcurveto{\pgfqpoint{1.245cm}{1.289cm}}{\pgfqpoint{1.28cm}{1.304cm}}{\pgfqpoint{1.305cm}{1.329cm}}
\pgfpathcurveto{\pgfqpoint{1.331cm}{1.355cm}}{\pgfqpoint{1.345cm}{1.39cm}}{\pgfqpoint{1.345cm}{1.426cm}}
\pgfusepath{fill}
\begin{pgfscope}
\pgfsetdash{}{0cm}
\pgfsetlinewidth{0.818mm}
\pgfsetroundcap
\pgfsetmiterlimit{4.0}
\pgfpathmoveto{\pgfqpoint{0.682cm}{0.726cm}}
\pgfpathlineto{\pgfqpoint{0.682cm}{0.097cm}}
\pgfusepath{stroke}
\end{pgfscope}
\end{pgfscope}
\end{pgfscope}
\end{pgfscope}
\end{tikzpicture}}}}[1]{#1^{\!\resizebox{0.6em}{!}{
\begin{tikzpicture}
\pgfpathmoveto{\pgfqpoint{0cm}{0cm}}
\pgfpathlineto{\pgfqpoint{1.376cm}{0cm}}
\pgfpathlineto{\pgfqpoint{1.376cm}{1.588cm}}
\pgfpathlineto{\pgfqpoint{0cm}{1.588cm}}
\pgfpathclose
\pgfusepath{clip}
\begin{pgfscope}
\begin{pgfscope}
\pgfpathmoveto{\pgfqpoint{0cm}{0cm}}
\pgfpathlineto{\pgfqpoint{1.376cm}{0cm}}
\pgfpathlineto{\pgfqpoint{1.376cm}{1.588cm}}
\pgfpathlineto{\pgfqpoint{0cm}{1.588cm}}
\pgfpathclose
\pgfusepath{clip}
\begin{pgfscope}
\begin{pgfscope}
\definecolor{eps2pgf_color}{gray}{0.976471}\pgfsetstrokecolor{eps2pgf_color}\pgfsetfillcolor{eps2pgf_color}
\pgfpathmoveto{\pgfqpoint{0cm}{0cm}}
\pgfpathlineto{\pgfqpoint{1.376cm}{0cm}}
\pgfpathlineto{\pgfqpoint{1.376cm}{1.588cm}}
\pgfpathlineto{\pgfqpoint{0cm}{1.588cm}}
\pgfpathclose
\pgfusepath{fill}
\end{pgfscope}
\begin{pgfscope}
\pgfsetdash{}{0cm}
\pgfsetlinewidth{0.818mm}
\pgfsetroundcap
\pgfsetroundjoin
\pgfsetmiterlimit{7.0}
\definecolor{eps2pgf_color}{gray}{0}\pgfsetstrokecolor{eps2pgf_color}\pgfsetfillcolor{eps2pgf_color}
\pgfpathmoveto{\pgfqpoint{0.117cm}{1.476cm}}
\pgfpathlineto{\pgfqpoint{0.682cm}{0.726cm}}
\pgfpathlineto{\pgfqpoint{1.246cm}{1.476cm}}
\pgfusepath{stroke}
\end{pgfscope}
\definecolor{eps2pgf_color}{gray}{0}\pgfsetstrokecolor{eps2pgf_color}\pgfsetfillcolor{eps2pgf_color}
\pgfpathmoveto{\pgfqpoint{0.273cm}{1.451cm}}
\pgfpathcurveto{\pgfqpoint{0.273cm}{1.487cm}}{\pgfqpoint{0.259cm}{1.522cm}}{\pgfqpoint{0.233cm}{1.547cm}}
\pgfpathcurveto{\pgfqpoint{0.207cm}{1.573cm}}{\pgfqpoint{0.173cm}{1.588cm}}{\pgfqpoint{0.137cm}{1.588cm}}
\pgfpathcurveto{\pgfqpoint{0.1cm}{1.588cm}}{\pgfqpoint{0.066cm}{1.573cm}}{\pgfqpoint{0.04cm}{1.547cm}}
\pgfpathcurveto{\pgfqpoint{0.014cm}{1.522cm}}{\pgfqpoint{0cm}{1.487cm}}{\pgfqpoint{0cm}{1.451cm}}
\pgfpathcurveto{\pgfqpoint{0cm}{1.414cm}}{\pgfqpoint{0.014cm}{1.379cm}}{\pgfqpoint{0.04cm}{1.354cm}}
\pgfpathcurveto{\pgfqpoint{0.066cm}{1.328cm}}{\pgfqpoint{0.1cm}{1.314cm}}{\pgfqpoint{0.137cm}{1.314cm}}
\pgfpathcurveto{\pgfqpoint{0.173cm}{1.314cm}}{\pgfqpoint{0.207cm}{1.328cm}}{\pgfqpoint{0.233cm}{1.354cm}}
\pgfpathcurveto{\pgfqpoint{0.259cm}{1.379cm}}{\pgfqpoint{0.273cm}{1.414cm}}{\pgfqpoint{0.273cm}{1.451cm}}
\pgfusepath{fill}
\pgfpathmoveto{\pgfqpoint{1.345cm}{1.426cm}}
\pgfpathcurveto{\pgfqpoint{1.345cm}{1.463cm}}{\pgfqpoint{1.331cm}{1.497cm}}{\pgfqpoint{1.305cm}{1.523cm}}
\pgfpathcurveto{\pgfqpoint{1.28cm}{1.549cm}}{\pgfqpoint{1.245cm}{1.563cm}}{\pgfqpoint{1.209cm}{1.563cm}}
\pgfpathcurveto{\pgfqpoint{1.172cm}{1.563cm}}{\pgfqpoint{1.138cm}{1.549cm}}{\pgfqpoint{1.112cm}{1.523cm}}
\pgfpathcurveto{\pgfqpoint{1.087cm}{1.497cm}}{\pgfqpoint{1.072cm}{1.463cm}}{\pgfqpoint{1.072cm}{1.426cm}}
\pgfpathcurveto{\pgfqpoint{1.072cm}{1.39cm}}{\pgfqpoint{1.087cm}{1.355cm}}{\pgfqpoint{1.112cm}{1.329cm}}
\pgfpathcurveto{\pgfqpoint{1.138cm}{1.304cm}}{\pgfqpoint{1.172cm}{1.289cm}}{\pgfqpoint{1.209cm}{1.289cm}}
\pgfpathcurveto{\pgfqpoint{1.245cm}{1.289cm}}{\pgfqpoint{1.28cm}{1.304cm}}{\pgfqpoint{1.305cm}{1.329cm}}
\pgfpathcurveto{\pgfqpoint{1.331cm}{1.355cm}}{\pgfqpoint{1.345cm}{1.39cm}}{\pgfqpoint{1.345cm}{1.426cm}}
\pgfusepath{fill}
\begin{pgfscope}
\pgfsetdash{}{0cm}
\pgfsetlinewidth{0.818mm}
\pgfsetroundcap
\pgfsetmiterlimit{4.0}
\pgfpathmoveto{\pgfqpoint{0.682cm}{0.726cm}}
\pgfpathlineto{\pgfqpoint{0.682cm}{0.097cm}}
\pgfusepath{stroke}
\end{pgfscope}
\end{pgfscope}
\end{pgfscope}
\end{pgfscope}
\end{tikzpicture}}}}
\newcommand{^{\!\resizebox{!}{.8em}{
\begin{tikzpicture}
\pgfpathmoveto{\pgfqpoint{0cm}{-0.035cm}}
\pgfpathlineto{\pgfqpoint{1.976cm}{-0.035cm}}
\pgfpathlineto{\pgfqpoint{1.976cm}{1.94cm}}
\pgfpathlineto{\pgfqpoint{0cm}{1.94cm}}
\pgfpathclose
\pgfusepath{clip}
\begin{pgfscope}
\begin{pgfscope}
\pgfpathmoveto{\pgfqpoint{0cm}{-0.035cm}}
\pgfpathlineto{\pgfqpoint{1.976cm}{-0.035cm}}
\pgfpathlineto{\pgfqpoint{1.976cm}{1.94cm}}
\pgfpathlineto{\pgfqpoint{0cm}{1.94cm}}
\pgfpathclose
\pgfusepath{clip}
\begin{pgfscope}
\begin{pgfscope}
\pgfsetdash{}{0cm}
\pgfsetlinewidth{0.818mm}
\pgfsetroundcap
\pgfsetroundjoin
\pgfsetmiterlimit{7.0}
\definecolor{eps2pgf_color}{gray}{0}\pgfsetstrokecolor{eps2pgf_color}\pgfsetfillcolor{eps2pgf_color}
\pgfpathmoveto{\pgfqpoint{0.117cm}{1.815cm}}
\pgfpathlineto{\pgfqpoint{0.682cm}{1.065cm}}
\pgfpathlineto{\pgfqpoint{1.246cm}{1.815cm}}
\pgfusepath{stroke}
\end{pgfscope}
\definecolor{eps2pgf_color}{gray}{0}\pgfsetstrokecolor{eps2pgf_color}\pgfsetfillcolor{eps2pgf_color}
\pgfpathmoveto{\pgfqpoint{0.273cm}{1.789cm}}
\pgfpathcurveto{\pgfqpoint{0.273cm}{1.825cm}}{\pgfqpoint{0.259cm}{1.86cm}}{\pgfqpoint{0.233cm}{1.886cm}}
\pgfpathcurveto{\pgfqpoint{0.207cm}{1.912cm}}{\pgfqpoint{0.173cm}{1.926cm}}{\pgfqpoint{0.137cm}{1.926cm}}
\pgfpathcurveto{\pgfqpoint{0.1cm}{1.926cm}}{\pgfqpoint{0.066cm}{1.912cm}}{\pgfqpoint{0.04cm}{1.886cm}}
\pgfpathcurveto{\pgfqpoint{0.014cm}{1.86cm}}{\pgfqpoint{0cm}{1.825cm}}{\pgfqpoint{0cm}{1.789cm}}
\pgfpathcurveto{\pgfqpoint{0cm}{1.753cm}}{\pgfqpoint{0.014cm}{1.718cm}}{\pgfqpoint{0.04cm}{1.692cm}}
\pgfpathcurveto{\pgfqpoint{0.066cm}{1.667cm}}{\pgfqpoint{0.1cm}{1.652cm}}{\pgfqpoint{0.137cm}{1.652cm}}
\pgfpathcurveto{\pgfqpoint{0.173cm}{1.652cm}}{\pgfqpoint{0.207cm}{1.667cm}}{\pgfqpoint{0.233cm}{1.692cm}}
\pgfpathcurveto{\pgfqpoint{0.259cm}{1.718cm}}{\pgfqpoint{0.273cm}{1.753cm}}{\pgfqpoint{0.273cm}{1.789cm}}
\pgfusepath{fill}
\begin{pgfscope}
\pgfsetdash{}{0cm}
\pgfsetlinewidth{0.818mm}
\pgfsetmiterlimit{7.0}
\pgfpathmoveto{\pgfqpoint{0.682cm}{1.065cm}}
\pgfpathlineto{\pgfqpoint{0.679cm}{1.812cm}}
\pgfusepath{stroke}
\end{pgfscope}
\pgfpathmoveto{\pgfqpoint{0.815cm}{1.793cm}}
\pgfpathcurveto{\pgfqpoint{0.815cm}{1.829cm}}{\pgfqpoint{0.801cm}{1.864cm}}{\pgfqpoint{0.775cm}{1.89cm}}
\pgfpathcurveto{\pgfqpoint{0.75cm}{1.915cm}}{\pgfqpoint{0.715cm}{1.93cm}}{\pgfqpoint{0.679cm}{1.93cm}}
\pgfpathcurveto{\pgfqpoint{0.643cm}{1.93cm}}{\pgfqpoint{0.608cm}{1.915cm}}{\pgfqpoint{0.582cm}{1.89cm}}
\pgfpathcurveto{\pgfqpoint{0.557cm}{1.864cm}}{\pgfqpoint{0.542cm}{1.829cm}}{\pgfqpoint{0.542cm}{1.793cm}}
\pgfpathcurveto{\pgfqpoint{0.542cm}{1.756cm}}{\pgfqpoint{0.557cm}{1.722cm}}{\pgfqpoint{0.582cm}{1.696cm}}
\pgfpathcurveto{\pgfqpoint{0.608cm}{1.67cm}}{\pgfqpoint{0.643cm}{1.656cm}}{\pgfqpoint{0.679cm}{1.656cm}}
\pgfpathcurveto{\pgfqpoint{0.715cm}{1.656cm}}{\pgfqpoint{0.75cm}{1.67cm}}{\pgfqpoint{0.775cm}{1.696cm}}
\pgfpathcurveto{\pgfqpoint{0.801cm}{1.722cm}}{\pgfqpoint{0.815cm}{1.756cm}}{\pgfqpoint{0.815cm}{1.793cm}}
\pgfusepath{fill}
\pgfpathmoveto{\pgfqpoint{1.345cm}{1.765cm}}
\pgfpathcurveto{\pgfqpoint{1.345cm}{1.801cm}}{\pgfqpoint{1.331cm}{1.836cm}}{\pgfqpoint{1.305cm}{1.862cm}}
\pgfpathcurveto{\pgfqpoint{1.28cm}{1.887cm}}{\pgfqpoint{1.245cm}{1.902cm}}{\pgfqpoint{1.209cm}{1.902cm}}
\pgfpathcurveto{\pgfqpoint{1.172cm}{1.902cm}}{\pgfqpoint{1.138cm}{1.887cm}}{\pgfqpoint{1.112cm}{1.862cm}}
\pgfpathcurveto{\pgfqpoint{1.087cm}{1.836cm}}{\pgfqpoint{1.072cm}{1.801cm}}{\pgfqpoint{1.072cm}{1.765cm}}
\pgfpathcurveto{\pgfqpoint{1.072cm}{1.728cm}}{\pgfqpoint{1.087cm}{1.694cm}}{\pgfqpoint{1.112cm}{1.668cm}}
\pgfpathcurveto{\pgfqpoint{1.138cm}{1.642cm}}{\pgfqpoint{1.172cm}{1.628cm}}{\pgfqpoint{1.209cm}{1.628cm}}
\pgfpathcurveto{\pgfqpoint{1.245cm}{1.628cm}}{\pgfqpoint{1.28cm}{1.642cm}}{\pgfqpoint{1.305cm}{1.668cm}}
\pgfpathcurveto{\pgfqpoint{1.331cm}{1.694cm}}{\pgfqpoint{1.345cm}{1.728cm}}{\pgfqpoint{1.345cm}{1.765cm}}
\pgfusepath{fill}
\begin{pgfscope}
\pgfsetdash{}{0cm}
\pgfsetlinewidth{0.818mm}
\pgfsetroundcap
\pgfsetroundjoin
\pgfsetmiterlimit{7.0}
\pgfpathmoveto{\pgfqpoint{0.682cm}{1.065cm}}
\pgfpathlineto{\pgfqpoint{1.246cm}{0.315cm}}
\pgfpathlineto{\pgfqpoint{1.811cm}{1.065cm}}
\pgfusepath{stroke}
\end{pgfscope}
\pgfpathmoveto{\pgfqpoint{1.948cm}{1.065cm}}
\pgfpathcurveto{\pgfqpoint{1.948cm}{1.101cm}}{\pgfqpoint{1.933cm}{1.136cm}}{\pgfqpoint{1.907cm}{1.162cm}}
\pgfpathcurveto{\pgfqpoint{1.882cm}{1.187cm}}{\pgfqpoint{1.847cm}{1.202cm}}{\pgfqpoint{1.811cm}{1.202cm}}
\pgfpathcurveto{\pgfqpoint{1.775cm}{1.202cm}}{\pgfqpoint{1.74cm}{1.187cm}}{\pgfqpoint{1.714cm}{1.162cm}}
\pgfpathcurveto{\pgfqpoint{1.689cm}{1.136cm}}{\pgfqpoint{1.674cm}{1.101cm}}{\pgfqpoint{1.674cm}{1.065cm}}
\pgfpathcurveto{\pgfqpoint{1.674cm}{1.029cm}}{\pgfqpoint{1.689cm}{0.994cm}}{\pgfqpoint{1.714cm}{0.968cm}}
\pgfpathcurveto{\pgfqpoint{1.74cm}{0.942cm}}{\pgfqpoint{1.775cm}{0.928cm}}{\pgfqpoint{1.811cm}{0.928cm}}
\pgfpathcurveto{\pgfqpoint{1.847cm}{0.928cm}}{\pgfqpoint{1.882cm}{0.942cm}}{\pgfqpoint{1.907cm}{0.968cm}}
\pgfpathcurveto{\pgfqpoint{1.933cm}{0.994cm}}{\pgfqpoint{1.948cm}{1.029cm}}{\pgfqpoint{1.948cm}{1.065cm}}
\pgfusepath{fill}
\begin{pgfscope}
\pgfsetdash{}{0cm}
\pgfsetlinewidth{0.818mm}
\pgfsetmiterlimit{7.0}
\pgfpathmoveto{\pgfqpoint{1.246cm}{0.315cm}}
\pgfpathlineto{\pgfqpoint{1.244cm}{1.061cm}}
\pgfusepath{stroke}
\end{pgfscope}
\pgfpathmoveto{\pgfqpoint{1.38cm}{1.065cm}}
\pgfpathcurveto{\pgfqpoint{1.38cm}{1.101cm}}{\pgfqpoint{1.366cm}{1.136cm}}{\pgfqpoint{1.34cm}{1.162cm}}
\pgfpathcurveto{\pgfqpoint{1.315cm}{1.187cm}}{\pgfqpoint{1.28cm}{1.202cm}}{\pgfqpoint{1.244cm}{1.202cm}}
\pgfpathcurveto{\pgfqpoint{1.207cm}{1.202cm}}{\pgfqpoint{1.173cm}{1.187cm}}{\pgfqpoint{1.147cm}{1.162cm}}
\pgfpathcurveto{\pgfqpoint{1.121cm}{1.136cm}}{\pgfqpoint{1.107cm}{1.101cm}}{\pgfqpoint{1.107cm}{1.065cm}}
\pgfpathcurveto{\pgfqpoint{1.107cm}{1.029cm}}{\pgfqpoint{1.121cm}{0.994cm}}{\pgfqpoint{1.147cm}{0.968cm}}
\pgfpathcurveto{\pgfqpoint{1.173cm}{0.942cm}}{\pgfqpoint{1.207cm}{0.928cm}}{\pgfqpoint{1.244cm}{0.928cm}}
\pgfpathcurveto{\pgfqpoint{1.28cm}{0.928cm}}{\pgfqpoint{1.315cm}{0.942cm}}{\pgfqpoint{1.34cm}{0.968cm}}
\pgfpathcurveto{\pgfqpoint{1.366cm}{0.994cm}}{\pgfqpoint{1.38cm}{1.029cm}}{\pgfqpoint{1.38cm}{1.065cm}}
\pgfusepath{fill}
\begin{pgfscope}
\pgfsetdash{}{0cm}
\pgfsetlinewidth{0.818mm}
\pgfsetmiterlimit{4.0}
\pgfpathmoveto{\pgfqpoint{1.383cm}{0.178cm}}
\pgfpathcurveto{\pgfqpoint{1.383cm}{0.214cm}}{\pgfqpoint{1.369cm}{0.249cm}}{\pgfqpoint{1.343cm}{0.275cm}}
\pgfpathcurveto{\pgfqpoint{1.317cm}{0.3cm}}{\pgfqpoint{1.283cm}{0.315cm}}{\pgfqpoint{1.246cm}{0.315cm}}
\pgfpathcurveto{\pgfqpoint{1.21cm}{0.315cm}}{\pgfqpoint{1.175cm}{0.3cm}}{\pgfqpoint{1.15cm}{0.275cm}}
\pgfpathcurveto{\pgfqpoint{1.124cm}{0.249cm}}{\pgfqpoint{1.11cm}{0.214cm}}{\pgfqpoint{1.11cm}{0.178cm}}
\pgfpathcurveto{\pgfqpoint{1.11cm}{0.141cm}}{\pgfqpoint{1.124cm}{0.107cm}}{\pgfqpoint{1.15cm}{0.081cm}}
\pgfpathcurveto{\pgfqpoint{1.175cm}{0.055cm}}{\pgfqpoint{1.21cm}{0.041cm}}{\pgfqpoint{1.246cm}{0.041cm}}
\pgfpathcurveto{\pgfqpoint{1.283cm}{0.041cm}}{\pgfqpoint{1.317cm}{0.055cm}}{\pgfqpoint{1.343cm}{0.081cm}}
\pgfpathcurveto{\pgfqpoint{1.369cm}{0.107cm}}{\pgfqpoint{1.383cm}{0.141cm}}{\pgfqpoint{1.383cm}{0.178cm}}
\pgfusepath{stroke}
\end{pgfscope}
\end{pgfscope}
\end{pgfscope}
\end{pgfscope}
\end{tikzpicture}}}}[1]{#1^{\!\resizebox{!}{.8em}{
\begin{tikzpicture}
\pgfpathmoveto{\pgfqpoint{0cm}{-0.035cm}}
\pgfpathlineto{\pgfqpoint{1.976cm}{-0.035cm}}
\pgfpathlineto{\pgfqpoint{1.976cm}{1.94cm}}
\pgfpathlineto{\pgfqpoint{0cm}{1.94cm}}
\pgfpathclose
\pgfusepath{clip}
\begin{pgfscope}
\begin{pgfscope}
\pgfpathmoveto{\pgfqpoint{0cm}{-0.035cm}}
\pgfpathlineto{\pgfqpoint{1.976cm}{-0.035cm}}
\pgfpathlineto{\pgfqpoint{1.976cm}{1.94cm}}
\pgfpathlineto{\pgfqpoint{0cm}{1.94cm}}
\pgfpathclose
\pgfusepath{clip}
\begin{pgfscope}
\begin{pgfscope}
\pgfsetdash{}{0cm}
\pgfsetlinewidth{0.818mm}
\pgfsetroundcap
\pgfsetroundjoin
\pgfsetmiterlimit{7.0}
\definecolor{eps2pgf_color}{gray}{0}\pgfsetstrokecolor{eps2pgf_color}\pgfsetfillcolor{eps2pgf_color}
\pgfpathmoveto{\pgfqpoint{0.117cm}{1.815cm}}
\pgfpathlineto{\pgfqpoint{0.682cm}{1.065cm}}
\pgfpathlineto{\pgfqpoint{1.246cm}{1.815cm}}
\pgfusepath{stroke}
\end{pgfscope}
\definecolor{eps2pgf_color}{gray}{0}\pgfsetstrokecolor{eps2pgf_color}\pgfsetfillcolor{eps2pgf_color}
\pgfpathmoveto{\pgfqpoint{0.273cm}{1.789cm}}
\pgfpathcurveto{\pgfqpoint{0.273cm}{1.825cm}}{\pgfqpoint{0.259cm}{1.86cm}}{\pgfqpoint{0.233cm}{1.886cm}}
\pgfpathcurveto{\pgfqpoint{0.207cm}{1.912cm}}{\pgfqpoint{0.173cm}{1.926cm}}{\pgfqpoint{0.137cm}{1.926cm}}
\pgfpathcurveto{\pgfqpoint{0.1cm}{1.926cm}}{\pgfqpoint{0.066cm}{1.912cm}}{\pgfqpoint{0.04cm}{1.886cm}}
\pgfpathcurveto{\pgfqpoint{0.014cm}{1.86cm}}{\pgfqpoint{0cm}{1.825cm}}{\pgfqpoint{0cm}{1.789cm}}
\pgfpathcurveto{\pgfqpoint{0cm}{1.753cm}}{\pgfqpoint{0.014cm}{1.718cm}}{\pgfqpoint{0.04cm}{1.692cm}}
\pgfpathcurveto{\pgfqpoint{0.066cm}{1.667cm}}{\pgfqpoint{0.1cm}{1.652cm}}{\pgfqpoint{0.137cm}{1.652cm}}
\pgfpathcurveto{\pgfqpoint{0.173cm}{1.652cm}}{\pgfqpoint{0.207cm}{1.667cm}}{\pgfqpoint{0.233cm}{1.692cm}}
\pgfpathcurveto{\pgfqpoint{0.259cm}{1.718cm}}{\pgfqpoint{0.273cm}{1.753cm}}{\pgfqpoint{0.273cm}{1.789cm}}
\pgfusepath{fill}
\begin{pgfscope}
\pgfsetdash{}{0cm}
\pgfsetlinewidth{0.818mm}
\pgfsetmiterlimit{7.0}
\pgfpathmoveto{\pgfqpoint{0.682cm}{1.065cm}}
\pgfpathlineto{\pgfqpoint{0.679cm}{1.812cm}}
\pgfusepath{stroke}
\end{pgfscope}
\pgfpathmoveto{\pgfqpoint{0.815cm}{1.793cm}}
\pgfpathcurveto{\pgfqpoint{0.815cm}{1.829cm}}{\pgfqpoint{0.801cm}{1.864cm}}{\pgfqpoint{0.775cm}{1.89cm}}
\pgfpathcurveto{\pgfqpoint{0.75cm}{1.915cm}}{\pgfqpoint{0.715cm}{1.93cm}}{\pgfqpoint{0.679cm}{1.93cm}}
\pgfpathcurveto{\pgfqpoint{0.643cm}{1.93cm}}{\pgfqpoint{0.608cm}{1.915cm}}{\pgfqpoint{0.582cm}{1.89cm}}
\pgfpathcurveto{\pgfqpoint{0.557cm}{1.864cm}}{\pgfqpoint{0.542cm}{1.829cm}}{\pgfqpoint{0.542cm}{1.793cm}}
\pgfpathcurveto{\pgfqpoint{0.542cm}{1.756cm}}{\pgfqpoint{0.557cm}{1.722cm}}{\pgfqpoint{0.582cm}{1.696cm}}
\pgfpathcurveto{\pgfqpoint{0.608cm}{1.67cm}}{\pgfqpoint{0.643cm}{1.656cm}}{\pgfqpoint{0.679cm}{1.656cm}}
\pgfpathcurveto{\pgfqpoint{0.715cm}{1.656cm}}{\pgfqpoint{0.75cm}{1.67cm}}{\pgfqpoint{0.775cm}{1.696cm}}
\pgfpathcurveto{\pgfqpoint{0.801cm}{1.722cm}}{\pgfqpoint{0.815cm}{1.756cm}}{\pgfqpoint{0.815cm}{1.793cm}}
\pgfusepath{fill}
\pgfpathmoveto{\pgfqpoint{1.345cm}{1.765cm}}
\pgfpathcurveto{\pgfqpoint{1.345cm}{1.801cm}}{\pgfqpoint{1.331cm}{1.836cm}}{\pgfqpoint{1.305cm}{1.862cm}}
\pgfpathcurveto{\pgfqpoint{1.28cm}{1.887cm}}{\pgfqpoint{1.245cm}{1.902cm}}{\pgfqpoint{1.209cm}{1.902cm}}
\pgfpathcurveto{\pgfqpoint{1.172cm}{1.902cm}}{\pgfqpoint{1.138cm}{1.887cm}}{\pgfqpoint{1.112cm}{1.862cm}}
\pgfpathcurveto{\pgfqpoint{1.087cm}{1.836cm}}{\pgfqpoint{1.072cm}{1.801cm}}{\pgfqpoint{1.072cm}{1.765cm}}
\pgfpathcurveto{\pgfqpoint{1.072cm}{1.728cm}}{\pgfqpoint{1.087cm}{1.694cm}}{\pgfqpoint{1.112cm}{1.668cm}}
\pgfpathcurveto{\pgfqpoint{1.138cm}{1.642cm}}{\pgfqpoint{1.172cm}{1.628cm}}{\pgfqpoint{1.209cm}{1.628cm}}
\pgfpathcurveto{\pgfqpoint{1.245cm}{1.628cm}}{\pgfqpoint{1.28cm}{1.642cm}}{\pgfqpoint{1.305cm}{1.668cm}}
\pgfpathcurveto{\pgfqpoint{1.331cm}{1.694cm}}{\pgfqpoint{1.345cm}{1.728cm}}{\pgfqpoint{1.345cm}{1.765cm}}
\pgfusepath{fill}
\begin{pgfscope}
\pgfsetdash{}{0cm}
\pgfsetlinewidth{0.818mm}
\pgfsetroundcap
\pgfsetroundjoin
\pgfsetmiterlimit{7.0}
\pgfpathmoveto{\pgfqpoint{0.682cm}{1.065cm}}
\pgfpathlineto{\pgfqpoint{1.246cm}{0.315cm}}
\pgfpathlineto{\pgfqpoint{1.811cm}{1.065cm}}
\pgfusepath{stroke}
\end{pgfscope}
\pgfpathmoveto{\pgfqpoint{1.948cm}{1.065cm}}
\pgfpathcurveto{\pgfqpoint{1.948cm}{1.101cm}}{\pgfqpoint{1.933cm}{1.136cm}}{\pgfqpoint{1.907cm}{1.162cm}}
\pgfpathcurveto{\pgfqpoint{1.882cm}{1.187cm}}{\pgfqpoint{1.847cm}{1.202cm}}{\pgfqpoint{1.811cm}{1.202cm}}
\pgfpathcurveto{\pgfqpoint{1.775cm}{1.202cm}}{\pgfqpoint{1.74cm}{1.187cm}}{\pgfqpoint{1.714cm}{1.162cm}}
\pgfpathcurveto{\pgfqpoint{1.689cm}{1.136cm}}{\pgfqpoint{1.674cm}{1.101cm}}{\pgfqpoint{1.674cm}{1.065cm}}
\pgfpathcurveto{\pgfqpoint{1.674cm}{1.029cm}}{\pgfqpoint{1.689cm}{0.994cm}}{\pgfqpoint{1.714cm}{0.968cm}}
\pgfpathcurveto{\pgfqpoint{1.74cm}{0.942cm}}{\pgfqpoint{1.775cm}{0.928cm}}{\pgfqpoint{1.811cm}{0.928cm}}
\pgfpathcurveto{\pgfqpoint{1.847cm}{0.928cm}}{\pgfqpoint{1.882cm}{0.942cm}}{\pgfqpoint{1.907cm}{0.968cm}}
\pgfpathcurveto{\pgfqpoint{1.933cm}{0.994cm}}{\pgfqpoint{1.948cm}{1.029cm}}{\pgfqpoint{1.948cm}{1.065cm}}
\pgfusepath{fill}
\begin{pgfscope}
\pgfsetdash{}{0cm}
\pgfsetlinewidth{0.818mm}
\pgfsetmiterlimit{7.0}
\pgfpathmoveto{\pgfqpoint{1.246cm}{0.315cm}}
\pgfpathlineto{\pgfqpoint{1.244cm}{1.061cm}}
\pgfusepath{stroke}
\end{pgfscope}
\pgfpathmoveto{\pgfqpoint{1.38cm}{1.065cm}}
\pgfpathcurveto{\pgfqpoint{1.38cm}{1.101cm}}{\pgfqpoint{1.366cm}{1.136cm}}{\pgfqpoint{1.34cm}{1.162cm}}
\pgfpathcurveto{\pgfqpoint{1.315cm}{1.187cm}}{\pgfqpoint{1.28cm}{1.202cm}}{\pgfqpoint{1.244cm}{1.202cm}}
\pgfpathcurveto{\pgfqpoint{1.207cm}{1.202cm}}{\pgfqpoint{1.173cm}{1.187cm}}{\pgfqpoint{1.147cm}{1.162cm}}
\pgfpathcurveto{\pgfqpoint{1.121cm}{1.136cm}}{\pgfqpoint{1.107cm}{1.101cm}}{\pgfqpoint{1.107cm}{1.065cm}}
\pgfpathcurveto{\pgfqpoint{1.107cm}{1.029cm}}{\pgfqpoint{1.121cm}{0.994cm}}{\pgfqpoint{1.147cm}{0.968cm}}
\pgfpathcurveto{\pgfqpoint{1.173cm}{0.942cm}}{\pgfqpoint{1.207cm}{0.928cm}}{\pgfqpoint{1.244cm}{0.928cm}}
\pgfpathcurveto{\pgfqpoint{1.28cm}{0.928cm}}{\pgfqpoint{1.315cm}{0.942cm}}{\pgfqpoint{1.34cm}{0.968cm}}
\pgfpathcurveto{\pgfqpoint{1.366cm}{0.994cm}}{\pgfqpoint{1.38cm}{1.029cm}}{\pgfqpoint{1.38cm}{1.065cm}}
\pgfusepath{fill}
\begin{pgfscope}
\pgfsetdash{}{0cm}
\pgfsetlinewidth{0.818mm}
\pgfsetmiterlimit{4.0}
\pgfpathmoveto{\pgfqpoint{1.383cm}{0.178cm}}
\pgfpathcurveto{\pgfqpoint{1.383cm}{0.214cm}}{\pgfqpoint{1.369cm}{0.249cm}}{\pgfqpoint{1.343cm}{0.275cm}}
\pgfpathcurveto{\pgfqpoint{1.317cm}{0.3cm}}{\pgfqpoint{1.283cm}{0.315cm}}{\pgfqpoint{1.246cm}{0.315cm}}
\pgfpathcurveto{\pgfqpoint{1.21cm}{0.315cm}}{\pgfqpoint{1.175cm}{0.3cm}}{\pgfqpoint{1.15cm}{0.275cm}}
\pgfpathcurveto{\pgfqpoint{1.124cm}{0.249cm}}{\pgfqpoint{1.11cm}{0.214cm}}{\pgfqpoint{1.11cm}{0.178cm}}
\pgfpathcurveto{\pgfqpoint{1.11cm}{0.141cm}}{\pgfqpoint{1.124cm}{0.107cm}}{\pgfqpoint{1.15cm}{0.081cm}}
\pgfpathcurveto{\pgfqpoint{1.175cm}{0.055cm}}{\pgfqpoint{1.21cm}{0.041cm}}{\pgfqpoint{1.246cm}{0.041cm}}
\pgfpathcurveto{\pgfqpoint{1.283cm}{0.041cm}}{\pgfqpoint{1.317cm}{0.055cm}}{\pgfqpoint{1.343cm}{0.081cm}}
\pgfpathcurveto{\pgfqpoint{1.369cm}{0.107cm}}{\pgfqpoint{1.383cm}{0.141cm}}{\pgfqpoint{1.383cm}{0.178cm}}
\pgfusepath{stroke}
\end{pgfscope}
\end{pgfscope}
\end{pgfscope}
\end{pgfscope}
\end{tikzpicture}}}}
\newcommand{^{\prime\!\resizebox{!}{.8em}{
\begin{tikzpicture}
\pgfpathmoveto{\pgfqpoint{0cm}{-0.035cm}}
\pgfpathlineto{\pgfqpoint{1.976cm}{-0.035cm}}
\pgfpathlineto{\pgfqpoint{1.976cm}{1.94cm}}
\pgfpathlineto{\pgfqpoint{0cm}{1.94cm}}
\pgfpathclose
\pgfusepath{clip}
\begin{pgfscope}
\begin{pgfscope}
\pgfpathmoveto{\pgfqpoint{0cm}{-0.035cm}}
\pgfpathlineto{\pgfqpoint{1.976cm}{-0.035cm}}
\pgfpathlineto{\pgfqpoint{1.976cm}{1.94cm}}
\pgfpathlineto{\pgfqpoint{0cm}{1.94cm}}
\pgfpathclose
\pgfusepath{clip}
\begin{pgfscope}
\begin{pgfscope}
\pgfsetdash{}{0cm}
\pgfsetlinewidth{0.818mm}
\pgfsetroundcap
\pgfsetroundjoin
\pgfsetmiterlimit{7.0}
\definecolor{eps2pgf_color}{gray}{0}\pgfsetstrokecolor{eps2pgf_color}\pgfsetfillcolor{eps2pgf_color}
\pgfpathmoveto{\pgfqpoint{0.117cm}{1.815cm}}
\pgfpathlineto{\pgfqpoint{0.682cm}{1.065cm}}
\pgfpathlineto{\pgfqpoint{1.246cm}{1.815cm}}
\pgfusepath{stroke}
\end{pgfscope}
\definecolor{eps2pgf_color}{gray}{0}\pgfsetstrokecolor{eps2pgf_color}\pgfsetfillcolor{eps2pgf_color}
\pgfpathmoveto{\pgfqpoint{0.273cm}{1.789cm}}
\pgfpathcurveto{\pgfqpoint{0.273cm}{1.825cm}}{\pgfqpoint{0.259cm}{1.86cm}}{\pgfqpoint{0.233cm}{1.886cm}}
\pgfpathcurveto{\pgfqpoint{0.207cm}{1.912cm}}{\pgfqpoint{0.173cm}{1.926cm}}{\pgfqpoint{0.137cm}{1.926cm}}
\pgfpathcurveto{\pgfqpoint{0.1cm}{1.926cm}}{\pgfqpoint{0.066cm}{1.912cm}}{\pgfqpoint{0.04cm}{1.886cm}}
\pgfpathcurveto{\pgfqpoint{0.014cm}{1.86cm}}{\pgfqpoint{0cm}{1.825cm}}{\pgfqpoint{0cm}{1.789cm}}
\pgfpathcurveto{\pgfqpoint{0cm}{1.753cm}}{\pgfqpoint{0.014cm}{1.718cm}}{\pgfqpoint{0.04cm}{1.692cm}}
\pgfpathcurveto{\pgfqpoint{0.066cm}{1.667cm}}{\pgfqpoint{0.1cm}{1.652cm}}{\pgfqpoint{0.137cm}{1.652cm}}
\pgfpathcurveto{\pgfqpoint{0.173cm}{1.652cm}}{\pgfqpoint{0.207cm}{1.667cm}}{\pgfqpoint{0.233cm}{1.692cm}}
\pgfpathcurveto{\pgfqpoint{0.259cm}{1.718cm}}{\pgfqpoint{0.273cm}{1.753cm}}{\pgfqpoint{0.273cm}{1.789cm}}
\pgfusepath{fill}
\begin{pgfscope}
\pgfsetdash{}{0cm}
\pgfsetlinewidth{0.818mm}
\pgfsetmiterlimit{7.0}
\pgfpathmoveto{\pgfqpoint{0.682cm}{1.065cm}}
\pgfpathlineto{\pgfqpoint{0.679cm}{1.812cm}}
\pgfusepath{stroke}
\end{pgfscope}
\pgfpathmoveto{\pgfqpoint{0.815cm}{1.793cm}}
\pgfpathcurveto{\pgfqpoint{0.815cm}{1.829cm}}{\pgfqpoint{0.801cm}{1.864cm}}{\pgfqpoint{0.775cm}{1.89cm}}
\pgfpathcurveto{\pgfqpoint{0.75cm}{1.915cm}}{\pgfqpoint{0.715cm}{1.93cm}}{\pgfqpoint{0.679cm}{1.93cm}}
\pgfpathcurveto{\pgfqpoint{0.643cm}{1.93cm}}{\pgfqpoint{0.608cm}{1.915cm}}{\pgfqpoint{0.582cm}{1.89cm}}
\pgfpathcurveto{\pgfqpoint{0.557cm}{1.864cm}}{\pgfqpoint{0.542cm}{1.829cm}}{\pgfqpoint{0.542cm}{1.793cm}}
\pgfpathcurveto{\pgfqpoint{0.542cm}{1.756cm}}{\pgfqpoint{0.557cm}{1.722cm}}{\pgfqpoint{0.582cm}{1.696cm}}
\pgfpathcurveto{\pgfqpoint{0.608cm}{1.67cm}}{\pgfqpoint{0.643cm}{1.656cm}}{\pgfqpoint{0.679cm}{1.656cm}}
\pgfpathcurveto{\pgfqpoint{0.715cm}{1.656cm}}{\pgfqpoint{0.75cm}{1.67cm}}{\pgfqpoint{0.775cm}{1.696cm}}
\pgfpathcurveto{\pgfqpoint{0.801cm}{1.722cm}}{\pgfqpoint{0.815cm}{1.756cm}}{\pgfqpoint{0.815cm}{1.793cm}}
\pgfusepath{fill}
\pgfpathmoveto{\pgfqpoint{1.345cm}{1.765cm}}
\pgfpathcurveto{\pgfqpoint{1.345cm}{1.801cm}}{\pgfqpoint{1.331cm}{1.836cm}}{\pgfqpoint{1.305cm}{1.862cm}}
\pgfpathcurveto{\pgfqpoint{1.28cm}{1.887cm}}{\pgfqpoint{1.245cm}{1.902cm}}{\pgfqpoint{1.209cm}{1.902cm}}
\pgfpathcurveto{\pgfqpoint{1.172cm}{1.902cm}}{\pgfqpoint{1.138cm}{1.887cm}}{\pgfqpoint{1.112cm}{1.862cm}}
\pgfpathcurveto{\pgfqpoint{1.087cm}{1.836cm}}{\pgfqpoint{1.072cm}{1.801cm}}{\pgfqpoint{1.072cm}{1.765cm}}
\pgfpathcurveto{\pgfqpoint{1.072cm}{1.728cm}}{\pgfqpoint{1.087cm}{1.694cm}}{\pgfqpoint{1.112cm}{1.668cm}}
\pgfpathcurveto{\pgfqpoint{1.138cm}{1.642cm}}{\pgfqpoint{1.172cm}{1.628cm}}{\pgfqpoint{1.209cm}{1.628cm}}
\pgfpathcurveto{\pgfqpoint{1.245cm}{1.628cm}}{\pgfqpoint{1.28cm}{1.642cm}}{\pgfqpoint{1.305cm}{1.668cm}}
\pgfpathcurveto{\pgfqpoint{1.331cm}{1.694cm}}{\pgfqpoint{1.345cm}{1.728cm}}{\pgfqpoint{1.345cm}{1.765cm}}
\pgfusepath{fill}
\begin{pgfscope}
\pgfsetdash{}{0cm}
\pgfsetlinewidth{0.818mm}
\pgfsetroundcap
\pgfsetroundjoin
\pgfsetmiterlimit{7.0}
\pgfpathmoveto{\pgfqpoint{0.682cm}{1.065cm}}
\pgfpathlineto{\pgfqpoint{1.246cm}{0.315cm}}
\pgfpathlineto{\pgfqpoint{1.811cm}{1.065cm}}
\pgfusepath{stroke}
\end{pgfscope}
\pgfpathmoveto{\pgfqpoint{1.948cm}{1.065cm}}
\pgfpathcurveto{\pgfqpoint{1.948cm}{1.101cm}}{\pgfqpoint{1.933cm}{1.136cm}}{\pgfqpoint{1.907cm}{1.162cm}}
\pgfpathcurveto{\pgfqpoint{1.882cm}{1.187cm}}{\pgfqpoint{1.847cm}{1.202cm}}{\pgfqpoint{1.811cm}{1.202cm}}
\pgfpathcurveto{\pgfqpoint{1.775cm}{1.202cm}}{\pgfqpoint{1.74cm}{1.187cm}}{\pgfqpoint{1.714cm}{1.162cm}}
\pgfpathcurveto{\pgfqpoint{1.689cm}{1.136cm}}{\pgfqpoint{1.674cm}{1.101cm}}{\pgfqpoint{1.674cm}{1.065cm}}
\pgfpathcurveto{\pgfqpoint{1.674cm}{1.029cm}}{\pgfqpoint{1.689cm}{0.994cm}}{\pgfqpoint{1.714cm}{0.968cm}}
\pgfpathcurveto{\pgfqpoint{1.74cm}{0.942cm}}{\pgfqpoint{1.775cm}{0.928cm}}{\pgfqpoint{1.811cm}{0.928cm}}
\pgfpathcurveto{\pgfqpoint{1.847cm}{0.928cm}}{\pgfqpoint{1.882cm}{0.942cm}}{\pgfqpoint{1.907cm}{0.968cm}}
\pgfpathcurveto{\pgfqpoint{1.933cm}{0.994cm}}{\pgfqpoint{1.948cm}{1.029cm}}{\pgfqpoint{1.948cm}{1.065cm}}
\pgfusepath{fill}
\begin{pgfscope}
\pgfsetdash{}{0cm}
\pgfsetlinewidth{0.818mm}
\pgfsetmiterlimit{7.0}
\pgfpathmoveto{\pgfqpoint{1.246cm}{0.315cm}}
\pgfpathlineto{\pgfqpoint{1.244cm}{1.061cm}}
\pgfusepath{stroke}
\end{pgfscope}
\pgfpathmoveto{\pgfqpoint{1.38cm}{1.065cm}}
\pgfpathcurveto{\pgfqpoint{1.38cm}{1.101cm}}{\pgfqpoint{1.366cm}{1.136cm}}{\pgfqpoint{1.34cm}{1.162cm}}
\pgfpathcurveto{\pgfqpoint{1.315cm}{1.187cm}}{\pgfqpoint{1.28cm}{1.202cm}}{\pgfqpoint{1.244cm}{1.202cm}}
\pgfpathcurveto{\pgfqpoint{1.207cm}{1.202cm}}{\pgfqpoint{1.173cm}{1.187cm}}{\pgfqpoint{1.147cm}{1.162cm}}
\pgfpathcurveto{\pgfqpoint{1.121cm}{1.136cm}}{\pgfqpoint{1.107cm}{1.101cm}}{\pgfqpoint{1.107cm}{1.065cm}}
\pgfpathcurveto{\pgfqpoint{1.107cm}{1.029cm}}{\pgfqpoint{1.121cm}{0.994cm}}{\pgfqpoint{1.147cm}{0.968cm}}
\pgfpathcurveto{\pgfqpoint{1.173cm}{0.942cm}}{\pgfqpoint{1.207cm}{0.928cm}}{\pgfqpoint{1.244cm}{0.928cm}}
\pgfpathcurveto{\pgfqpoint{1.28cm}{0.928cm}}{\pgfqpoint{1.315cm}{0.942cm}}{\pgfqpoint{1.34cm}{0.968cm}}
\pgfpathcurveto{\pgfqpoint{1.366cm}{0.994cm}}{\pgfqpoint{1.38cm}{1.029cm}}{\pgfqpoint{1.38cm}{1.065cm}}
\pgfusepath{fill}
\begin{pgfscope}
\pgfsetdash{}{0cm}
\pgfsetlinewidth{0.818mm}
\pgfsetmiterlimit{4.0}
\pgfpathmoveto{\pgfqpoint{1.383cm}{0.178cm}}
\pgfpathcurveto{\pgfqpoint{1.383cm}{0.214cm}}{\pgfqpoint{1.369cm}{0.249cm}}{\pgfqpoint{1.343cm}{0.275cm}}
\pgfpathcurveto{\pgfqpoint{1.317cm}{0.3cm}}{\pgfqpoint{1.283cm}{0.315cm}}{\pgfqpoint{1.246cm}{0.315cm}}
\pgfpathcurveto{\pgfqpoint{1.21cm}{0.315cm}}{\pgfqpoint{1.175cm}{0.3cm}}{\pgfqpoint{1.15cm}{0.275cm}}
\pgfpathcurveto{\pgfqpoint{1.124cm}{0.249cm}}{\pgfqpoint{1.11cm}{0.214cm}}{\pgfqpoint{1.11cm}{0.178cm}}
\pgfpathcurveto{\pgfqpoint{1.11cm}{0.141cm}}{\pgfqpoint{1.124cm}{0.107cm}}{\pgfqpoint{1.15cm}{0.081cm}}
\pgfpathcurveto{\pgfqpoint{1.175cm}{0.055cm}}{\pgfqpoint{1.21cm}{0.041cm}}{\pgfqpoint{1.246cm}{0.041cm}}
\pgfpathcurveto{\pgfqpoint{1.283cm}{0.041cm}}{\pgfqpoint{1.317cm}{0.055cm}}{\pgfqpoint{1.343cm}{0.081cm}}
\pgfpathcurveto{\pgfqpoint{1.369cm}{0.107cm}}{\pgfqpoint{1.383cm}{0.141cm}}{\pgfqpoint{1.383cm}{0.178cm}}
\pgfusepath{stroke}
\end{pgfscope}
\end{pgfscope}
\end{pgfscope}
\end{pgfscope}
\end{tikzpicture}}}}[1]{#1^{\prime\!\resizebox{!}{.8em}{
\begin{tikzpicture}
\pgfpathmoveto{\pgfqpoint{0cm}{-0.035cm}}
\pgfpathlineto{\pgfqpoint{1.976cm}{-0.035cm}}
\pgfpathlineto{\pgfqpoint{1.976cm}{1.94cm}}
\pgfpathlineto{\pgfqpoint{0cm}{1.94cm}}
\pgfpathclose
\pgfusepath{clip}
\begin{pgfscope}
\begin{pgfscope}
\pgfpathmoveto{\pgfqpoint{0cm}{-0.035cm}}
\pgfpathlineto{\pgfqpoint{1.976cm}{-0.035cm}}
\pgfpathlineto{\pgfqpoint{1.976cm}{1.94cm}}
\pgfpathlineto{\pgfqpoint{0cm}{1.94cm}}
\pgfpathclose
\pgfusepath{clip}
\begin{pgfscope}
\begin{pgfscope}
\pgfsetdash{}{0cm}
\pgfsetlinewidth{0.818mm}
\pgfsetroundcap
\pgfsetroundjoin
\pgfsetmiterlimit{7.0}
\definecolor{eps2pgf_color}{gray}{0}\pgfsetstrokecolor{eps2pgf_color}\pgfsetfillcolor{eps2pgf_color}
\pgfpathmoveto{\pgfqpoint{0.117cm}{1.815cm}}
\pgfpathlineto{\pgfqpoint{0.682cm}{1.065cm}}
\pgfpathlineto{\pgfqpoint{1.246cm}{1.815cm}}
\pgfusepath{stroke}
\end{pgfscope}
\definecolor{eps2pgf_color}{gray}{0}\pgfsetstrokecolor{eps2pgf_color}\pgfsetfillcolor{eps2pgf_color}
\pgfpathmoveto{\pgfqpoint{0.273cm}{1.789cm}}
\pgfpathcurveto{\pgfqpoint{0.273cm}{1.825cm}}{\pgfqpoint{0.259cm}{1.86cm}}{\pgfqpoint{0.233cm}{1.886cm}}
\pgfpathcurveto{\pgfqpoint{0.207cm}{1.912cm}}{\pgfqpoint{0.173cm}{1.926cm}}{\pgfqpoint{0.137cm}{1.926cm}}
\pgfpathcurveto{\pgfqpoint{0.1cm}{1.926cm}}{\pgfqpoint{0.066cm}{1.912cm}}{\pgfqpoint{0.04cm}{1.886cm}}
\pgfpathcurveto{\pgfqpoint{0.014cm}{1.86cm}}{\pgfqpoint{0cm}{1.825cm}}{\pgfqpoint{0cm}{1.789cm}}
\pgfpathcurveto{\pgfqpoint{0cm}{1.753cm}}{\pgfqpoint{0.014cm}{1.718cm}}{\pgfqpoint{0.04cm}{1.692cm}}
\pgfpathcurveto{\pgfqpoint{0.066cm}{1.667cm}}{\pgfqpoint{0.1cm}{1.652cm}}{\pgfqpoint{0.137cm}{1.652cm}}
\pgfpathcurveto{\pgfqpoint{0.173cm}{1.652cm}}{\pgfqpoint{0.207cm}{1.667cm}}{\pgfqpoint{0.233cm}{1.692cm}}
\pgfpathcurveto{\pgfqpoint{0.259cm}{1.718cm}}{\pgfqpoint{0.273cm}{1.753cm}}{\pgfqpoint{0.273cm}{1.789cm}}
\pgfusepath{fill}
\begin{pgfscope}
\pgfsetdash{}{0cm}
\pgfsetlinewidth{0.818mm}
\pgfsetmiterlimit{7.0}
\pgfpathmoveto{\pgfqpoint{0.682cm}{1.065cm}}
\pgfpathlineto{\pgfqpoint{0.679cm}{1.812cm}}
\pgfusepath{stroke}
\end{pgfscope}
\pgfpathmoveto{\pgfqpoint{0.815cm}{1.793cm}}
\pgfpathcurveto{\pgfqpoint{0.815cm}{1.829cm}}{\pgfqpoint{0.801cm}{1.864cm}}{\pgfqpoint{0.775cm}{1.89cm}}
\pgfpathcurveto{\pgfqpoint{0.75cm}{1.915cm}}{\pgfqpoint{0.715cm}{1.93cm}}{\pgfqpoint{0.679cm}{1.93cm}}
\pgfpathcurveto{\pgfqpoint{0.643cm}{1.93cm}}{\pgfqpoint{0.608cm}{1.915cm}}{\pgfqpoint{0.582cm}{1.89cm}}
\pgfpathcurveto{\pgfqpoint{0.557cm}{1.864cm}}{\pgfqpoint{0.542cm}{1.829cm}}{\pgfqpoint{0.542cm}{1.793cm}}
\pgfpathcurveto{\pgfqpoint{0.542cm}{1.756cm}}{\pgfqpoint{0.557cm}{1.722cm}}{\pgfqpoint{0.582cm}{1.696cm}}
\pgfpathcurveto{\pgfqpoint{0.608cm}{1.67cm}}{\pgfqpoint{0.643cm}{1.656cm}}{\pgfqpoint{0.679cm}{1.656cm}}
\pgfpathcurveto{\pgfqpoint{0.715cm}{1.656cm}}{\pgfqpoint{0.75cm}{1.67cm}}{\pgfqpoint{0.775cm}{1.696cm}}
\pgfpathcurveto{\pgfqpoint{0.801cm}{1.722cm}}{\pgfqpoint{0.815cm}{1.756cm}}{\pgfqpoint{0.815cm}{1.793cm}}
\pgfusepath{fill}
\pgfpathmoveto{\pgfqpoint{1.345cm}{1.765cm}}
\pgfpathcurveto{\pgfqpoint{1.345cm}{1.801cm}}{\pgfqpoint{1.331cm}{1.836cm}}{\pgfqpoint{1.305cm}{1.862cm}}
\pgfpathcurveto{\pgfqpoint{1.28cm}{1.887cm}}{\pgfqpoint{1.245cm}{1.902cm}}{\pgfqpoint{1.209cm}{1.902cm}}
\pgfpathcurveto{\pgfqpoint{1.172cm}{1.902cm}}{\pgfqpoint{1.138cm}{1.887cm}}{\pgfqpoint{1.112cm}{1.862cm}}
\pgfpathcurveto{\pgfqpoint{1.087cm}{1.836cm}}{\pgfqpoint{1.072cm}{1.801cm}}{\pgfqpoint{1.072cm}{1.765cm}}
\pgfpathcurveto{\pgfqpoint{1.072cm}{1.728cm}}{\pgfqpoint{1.087cm}{1.694cm}}{\pgfqpoint{1.112cm}{1.668cm}}
\pgfpathcurveto{\pgfqpoint{1.138cm}{1.642cm}}{\pgfqpoint{1.172cm}{1.628cm}}{\pgfqpoint{1.209cm}{1.628cm}}
\pgfpathcurveto{\pgfqpoint{1.245cm}{1.628cm}}{\pgfqpoint{1.28cm}{1.642cm}}{\pgfqpoint{1.305cm}{1.668cm}}
\pgfpathcurveto{\pgfqpoint{1.331cm}{1.694cm}}{\pgfqpoint{1.345cm}{1.728cm}}{\pgfqpoint{1.345cm}{1.765cm}}
\pgfusepath{fill}
\begin{pgfscope}
\pgfsetdash{}{0cm}
\pgfsetlinewidth{0.818mm}
\pgfsetroundcap
\pgfsetroundjoin
\pgfsetmiterlimit{7.0}
\pgfpathmoveto{\pgfqpoint{0.682cm}{1.065cm}}
\pgfpathlineto{\pgfqpoint{1.246cm}{0.315cm}}
\pgfpathlineto{\pgfqpoint{1.811cm}{1.065cm}}
\pgfusepath{stroke}
\end{pgfscope}
\pgfpathmoveto{\pgfqpoint{1.948cm}{1.065cm}}
\pgfpathcurveto{\pgfqpoint{1.948cm}{1.101cm}}{\pgfqpoint{1.933cm}{1.136cm}}{\pgfqpoint{1.907cm}{1.162cm}}
\pgfpathcurveto{\pgfqpoint{1.882cm}{1.187cm}}{\pgfqpoint{1.847cm}{1.202cm}}{\pgfqpoint{1.811cm}{1.202cm}}
\pgfpathcurveto{\pgfqpoint{1.775cm}{1.202cm}}{\pgfqpoint{1.74cm}{1.187cm}}{\pgfqpoint{1.714cm}{1.162cm}}
\pgfpathcurveto{\pgfqpoint{1.689cm}{1.136cm}}{\pgfqpoint{1.674cm}{1.101cm}}{\pgfqpoint{1.674cm}{1.065cm}}
\pgfpathcurveto{\pgfqpoint{1.674cm}{1.029cm}}{\pgfqpoint{1.689cm}{0.994cm}}{\pgfqpoint{1.714cm}{0.968cm}}
\pgfpathcurveto{\pgfqpoint{1.74cm}{0.942cm}}{\pgfqpoint{1.775cm}{0.928cm}}{\pgfqpoint{1.811cm}{0.928cm}}
\pgfpathcurveto{\pgfqpoint{1.847cm}{0.928cm}}{\pgfqpoint{1.882cm}{0.942cm}}{\pgfqpoint{1.907cm}{0.968cm}}
\pgfpathcurveto{\pgfqpoint{1.933cm}{0.994cm}}{\pgfqpoint{1.948cm}{1.029cm}}{\pgfqpoint{1.948cm}{1.065cm}}
\pgfusepath{fill}
\begin{pgfscope}
\pgfsetdash{}{0cm}
\pgfsetlinewidth{0.818mm}
\pgfsetmiterlimit{7.0}
\pgfpathmoveto{\pgfqpoint{1.246cm}{0.315cm}}
\pgfpathlineto{\pgfqpoint{1.244cm}{1.061cm}}
\pgfusepath{stroke}
\end{pgfscope}
\pgfpathmoveto{\pgfqpoint{1.38cm}{1.065cm}}
\pgfpathcurveto{\pgfqpoint{1.38cm}{1.101cm}}{\pgfqpoint{1.366cm}{1.136cm}}{\pgfqpoint{1.34cm}{1.162cm}}
\pgfpathcurveto{\pgfqpoint{1.315cm}{1.187cm}}{\pgfqpoint{1.28cm}{1.202cm}}{\pgfqpoint{1.244cm}{1.202cm}}
\pgfpathcurveto{\pgfqpoint{1.207cm}{1.202cm}}{\pgfqpoint{1.173cm}{1.187cm}}{\pgfqpoint{1.147cm}{1.162cm}}
\pgfpathcurveto{\pgfqpoint{1.121cm}{1.136cm}}{\pgfqpoint{1.107cm}{1.101cm}}{\pgfqpoint{1.107cm}{1.065cm}}
\pgfpathcurveto{\pgfqpoint{1.107cm}{1.029cm}}{\pgfqpoint{1.121cm}{0.994cm}}{\pgfqpoint{1.147cm}{0.968cm}}
\pgfpathcurveto{\pgfqpoint{1.173cm}{0.942cm}}{\pgfqpoint{1.207cm}{0.928cm}}{\pgfqpoint{1.244cm}{0.928cm}}
\pgfpathcurveto{\pgfqpoint{1.28cm}{0.928cm}}{\pgfqpoint{1.315cm}{0.942cm}}{\pgfqpoint{1.34cm}{0.968cm}}
\pgfpathcurveto{\pgfqpoint{1.366cm}{0.994cm}}{\pgfqpoint{1.38cm}{1.029cm}}{\pgfqpoint{1.38cm}{1.065cm}}
\pgfusepath{fill}
\begin{pgfscope}
\pgfsetdash{}{0cm}
\pgfsetlinewidth{0.818mm}
\pgfsetmiterlimit{4.0}
\pgfpathmoveto{\pgfqpoint{1.383cm}{0.178cm}}
\pgfpathcurveto{\pgfqpoint{1.383cm}{0.214cm}}{\pgfqpoint{1.369cm}{0.249cm}}{\pgfqpoint{1.343cm}{0.275cm}}
\pgfpathcurveto{\pgfqpoint{1.317cm}{0.3cm}}{\pgfqpoint{1.283cm}{0.315cm}}{\pgfqpoint{1.246cm}{0.315cm}}
\pgfpathcurveto{\pgfqpoint{1.21cm}{0.315cm}}{\pgfqpoint{1.175cm}{0.3cm}}{\pgfqpoint{1.15cm}{0.275cm}}
\pgfpathcurveto{\pgfqpoint{1.124cm}{0.249cm}}{\pgfqpoint{1.11cm}{0.214cm}}{\pgfqpoint{1.11cm}{0.178cm}}
\pgfpathcurveto{\pgfqpoint{1.11cm}{0.141cm}}{\pgfqpoint{1.124cm}{0.107cm}}{\pgfqpoint{1.15cm}{0.081cm}}
\pgfpathcurveto{\pgfqpoint{1.175cm}{0.055cm}}{\pgfqpoint{1.21cm}{0.041cm}}{\pgfqpoint{1.246cm}{0.041cm}}
\pgfpathcurveto{\pgfqpoint{1.283cm}{0.041cm}}{\pgfqpoint{1.317cm}{0.055cm}}{\pgfqpoint{1.343cm}{0.081cm}}
\pgfpathcurveto{\pgfqpoint{1.369cm}{0.107cm}}{\pgfqpoint{1.383cm}{0.141cm}}{\pgfqpoint{1.383cm}{0.178cm}}
\pgfusepath{stroke}
\end{pgfscope}
\end{pgfscope}
\end{pgfscope}
\end{pgfscope}
\end{tikzpicture}}}}
\newcommand{^{\!\resizebox{!}{.8em}{
\begin{tikzpicture}
\pgfpathmoveto{\pgfqpoint{0cm}{-0.035cm}}
\pgfpathlineto{\pgfqpoint{1.976cm}{-0.035cm}}
\pgfpathlineto{\pgfqpoint{1.976cm}{1.94cm}}
\pgfpathlineto{\pgfqpoint{0cm}{1.94cm}}
\pgfpathclose
\pgfusepath{clip}
\begin{pgfscope}
\begin{pgfscope}
\pgfpathmoveto{\pgfqpoint{0cm}{-0.035cm}}
\pgfpathlineto{\pgfqpoint{1.976cm}{-0.035cm}}
\pgfpathlineto{\pgfqpoint{1.976cm}{1.94cm}}
\pgfpathlineto{\pgfqpoint{0cm}{1.94cm}}
\pgfpathclose
\pgfusepath{clip}
\begin{pgfscope}
\begin{pgfscope}
\pgfsetdash{}{0cm}
\pgfsetlinewidth{0.818mm}
\pgfsetroundcap
\pgfsetroundjoin
\pgfsetmiterlimit{7.0}
\definecolor{eps2pgf_color}{gray}{0}\pgfsetstrokecolor{eps2pgf_color}\pgfsetfillcolor{eps2pgf_color}
\pgfpathmoveto{\pgfqpoint{0.117cm}{1.815cm}}
\pgfpathlineto{\pgfqpoint{0.682cm}{1.065cm}}
\pgfpathlineto{\pgfqpoint{1.246cm}{1.815cm}}
\pgfusepath{stroke}
\end{pgfscope}
\definecolor{eps2pgf_color}{gray}{0}\pgfsetstrokecolor{eps2pgf_color}\pgfsetfillcolor{eps2pgf_color}
\pgfpathmoveto{\pgfqpoint{0.273cm}{1.789cm}}
\pgfpathcurveto{\pgfqpoint{0.273cm}{1.825cm}}{\pgfqpoint{0.259cm}{1.86cm}}{\pgfqpoint{0.233cm}{1.886cm}}
\pgfpathcurveto{\pgfqpoint{0.207cm}{1.912cm}}{\pgfqpoint{0.173cm}{1.926cm}}{\pgfqpoint{0.137cm}{1.926cm}}
\pgfpathcurveto{\pgfqpoint{0.1cm}{1.926cm}}{\pgfqpoint{0.066cm}{1.912cm}}{\pgfqpoint{0.04cm}{1.886cm}}
\pgfpathcurveto{\pgfqpoint{0.014cm}{1.86cm}}{\pgfqpoint{0cm}{1.825cm}}{\pgfqpoint{0cm}{1.789cm}}
\pgfpathcurveto{\pgfqpoint{0cm}{1.753cm}}{\pgfqpoint{0.014cm}{1.718cm}}{\pgfqpoint{0.04cm}{1.692cm}}
\pgfpathcurveto{\pgfqpoint{0.066cm}{1.667cm}}{\pgfqpoint{0.1cm}{1.652cm}}{\pgfqpoint{0.137cm}{1.652cm}}
\pgfpathcurveto{\pgfqpoint{0.173cm}{1.652cm}}{\pgfqpoint{0.207cm}{1.667cm}}{\pgfqpoint{0.233cm}{1.692cm}}
\pgfpathcurveto{\pgfqpoint{0.259cm}{1.718cm}}{\pgfqpoint{0.273cm}{1.753cm}}{\pgfqpoint{0.273cm}{1.789cm}}
\pgfusepath{fill}
\pgfpathmoveto{\pgfqpoint{1.345cm}{1.765cm}}
\pgfpathcurveto{\pgfqpoint{1.345cm}{1.801cm}}{\pgfqpoint{1.331cm}{1.836cm}}{\pgfqpoint{1.305cm}{1.862cm}}
\pgfpathcurveto{\pgfqpoint{1.28cm}{1.887cm}}{\pgfqpoint{1.245cm}{1.902cm}}{\pgfqpoint{1.209cm}{1.902cm}}
\pgfpathcurveto{\pgfqpoint{1.172cm}{1.902cm}}{\pgfqpoint{1.138cm}{1.887cm}}{\pgfqpoint{1.112cm}{1.862cm}}
\pgfpathcurveto{\pgfqpoint{1.087cm}{1.836cm}}{\pgfqpoint{1.072cm}{1.801cm}}{\pgfqpoint{1.072cm}{1.765cm}}
\pgfpathcurveto{\pgfqpoint{1.072cm}{1.728cm}}{\pgfqpoint{1.087cm}{1.694cm}}{\pgfqpoint{1.112cm}{1.668cm}}
\pgfpathcurveto{\pgfqpoint{1.138cm}{1.642cm}}{\pgfqpoint{1.172cm}{1.628cm}}{\pgfqpoint{1.209cm}{1.628cm}}
\pgfpathcurveto{\pgfqpoint{1.245cm}{1.628cm}}{\pgfqpoint{1.28cm}{1.642cm}}{\pgfqpoint{1.305cm}{1.668cm}}
\pgfpathcurveto{\pgfqpoint{1.331cm}{1.694cm}}{\pgfqpoint{1.345cm}{1.728cm}}{\pgfqpoint{1.345cm}{1.765cm}}
\pgfusepath{fill}
\begin{pgfscope}
\pgfsetdash{}{0cm}
\pgfsetlinewidth{0.818mm}
\pgfsetroundcap
\pgfsetroundjoin
\pgfsetmiterlimit{7.0}
\pgfpathmoveto{\pgfqpoint{0.682cm}{1.065cm}}
\pgfpathlineto{\pgfqpoint{1.246cm}{0.315cm}}
\pgfpathlineto{\pgfqpoint{1.811cm}{1.065cm}}
\pgfusepath{stroke}
\end{pgfscope}
\pgfpathmoveto{\pgfqpoint{1.948cm}{1.065cm}}
\pgfpathcurveto{\pgfqpoint{1.948cm}{1.101cm}}{\pgfqpoint{1.933cm}{1.136cm}}{\pgfqpoint{1.907cm}{1.162cm}}
\pgfpathcurveto{\pgfqpoint{1.882cm}{1.187cm}}{\pgfqpoint{1.847cm}{1.202cm}}{\pgfqpoint{1.811cm}{1.202cm}}
\pgfpathcurveto{\pgfqpoint{1.775cm}{1.202cm}}{\pgfqpoint{1.74cm}{1.187cm}}{\pgfqpoint{1.714cm}{1.162cm}}
\pgfpathcurveto{\pgfqpoint{1.689cm}{1.136cm}}{\pgfqpoint{1.674cm}{1.101cm}}{\pgfqpoint{1.674cm}{1.065cm}}
\pgfpathcurveto{\pgfqpoint{1.674cm}{1.029cm}}{\pgfqpoint{1.689cm}{0.994cm}}{\pgfqpoint{1.714cm}{0.968cm}}
\pgfpathcurveto{\pgfqpoint{1.74cm}{0.942cm}}{\pgfqpoint{1.775cm}{0.928cm}}{\pgfqpoint{1.811cm}{0.928cm}}
\pgfpathcurveto{\pgfqpoint{1.847cm}{0.928cm}}{\pgfqpoint{1.882cm}{0.942cm}}{\pgfqpoint{1.907cm}{0.968cm}}
\pgfpathcurveto{\pgfqpoint{1.933cm}{0.994cm}}{\pgfqpoint{1.948cm}{1.029cm}}{\pgfqpoint{1.948cm}{1.065cm}}
\pgfusepath{fill}
\begin{pgfscope}
\pgfsetdash{}{0cm}
\pgfsetlinewidth{0.818mm}
\pgfsetmiterlimit{7.0}
\pgfpathmoveto{\pgfqpoint{1.246cm}{0.315cm}}
\pgfpathlineto{\pgfqpoint{1.244cm}{1.061cm}}
\pgfusepath{stroke}
\end{pgfscope}
\pgfpathmoveto{\pgfqpoint{1.38cm}{1.065cm}}
\pgfpathcurveto{\pgfqpoint{1.38cm}{1.101cm}}{\pgfqpoint{1.366cm}{1.136cm}}{\pgfqpoint{1.34cm}{1.162cm}}
\pgfpathcurveto{\pgfqpoint{1.315cm}{1.187cm}}{\pgfqpoint{1.28cm}{1.202cm}}{\pgfqpoint{1.244cm}{1.202cm}}
\pgfpathcurveto{\pgfqpoint{1.207cm}{1.202cm}}{\pgfqpoint{1.173cm}{1.187cm}}{\pgfqpoint{1.147cm}{1.162cm}}
\pgfpathcurveto{\pgfqpoint{1.121cm}{1.136cm}}{\pgfqpoint{1.107cm}{1.101cm}}{\pgfqpoint{1.107cm}{1.065cm}}
\pgfpathcurveto{\pgfqpoint{1.107cm}{1.029cm}}{\pgfqpoint{1.121cm}{0.994cm}}{\pgfqpoint{1.147cm}{0.968cm}}
\pgfpathcurveto{\pgfqpoint{1.173cm}{0.942cm}}{\pgfqpoint{1.207cm}{0.928cm}}{\pgfqpoint{1.244cm}{0.928cm}}
\pgfpathcurveto{\pgfqpoint{1.28cm}{0.928cm}}{\pgfqpoint{1.315cm}{0.942cm}}{\pgfqpoint{1.34cm}{0.968cm}}
\pgfpathcurveto{\pgfqpoint{1.366cm}{0.994cm}}{\pgfqpoint{1.38cm}{1.029cm}}{\pgfqpoint{1.38cm}{1.065cm}}
\pgfusepath{fill}
\begin{pgfscope}
\pgfsetdash{}{0cm}
\pgfsetlinewidth{0.818mm}
\pgfsetmiterlimit{4.0}
\pgfpathmoveto{\pgfqpoint{1.383cm}{0.178cm}}
\pgfpathcurveto{\pgfqpoint{1.383cm}{0.214cm}}{\pgfqpoint{1.369cm}{0.249cm}}{\pgfqpoint{1.343cm}{0.275cm}}
\pgfpathcurveto{\pgfqpoint{1.317cm}{0.3cm}}{\pgfqpoint{1.283cm}{0.315cm}}{\pgfqpoint{1.246cm}{0.315cm}}
\pgfpathcurveto{\pgfqpoint{1.21cm}{0.315cm}}{\pgfqpoint{1.175cm}{0.3cm}}{\pgfqpoint{1.15cm}{0.275cm}}
\pgfpathcurveto{\pgfqpoint{1.124cm}{0.249cm}}{\pgfqpoint{1.11cm}{0.214cm}}{\pgfqpoint{1.11cm}{0.178cm}}
\pgfpathcurveto{\pgfqpoint{1.11cm}{0.141cm}}{\pgfqpoint{1.124cm}{0.107cm}}{\pgfqpoint{1.15cm}{0.081cm}}
\pgfpathcurveto{\pgfqpoint{1.175cm}{0.055cm}}{\pgfqpoint{1.21cm}{0.041cm}}{\pgfqpoint{1.246cm}{0.041cm}}
\pgfpathcurveto{\pgfqpoint{1.283cm}{0.041cm}}{\pgfqpoint{1.317cm}{0.055cm}}{\pgfqpoint{1.343cm}{0.081cm}}
\pgfpathcurveto{\pgfqpoint{1.369cm}{0.107cm}}{\pgfqpoint{1.383cm}{0.141cm}}{\pgfqpoint{1.383cm}{0.178cm}}
\pgfusepath{stroke}
\end{pgfscope}
\end{pgfscope}
\end{pgfscope}
\end{pgfscope}
\end{tikzpicture}}}}[1]{#1^{\!\resizebox{!}{.8em}{
\begin{tikzpicture}
\pgfpathmoveto{\pgfqpoint{0cm}{-0.035cm}}
\pgfpathlineto{\pgfqpoint{1.976cm}{-0.035cm}}
\pgfpathlineto{\pgfqpoint{1.976cm}{1.94cm}}
\pgfpathlineto{\pgfqpoint{0cm}{1.94cm}}
\pgfpathclose
\pgfusepath{clip}
\begin{pgfscope}
\begin{pgfscope}
\pgfpathmoveto{\pgfqpoint{0cm}{-0.035cm}}
\pgfpathlineto{\pgfqpoint{1.976cm}{-0.035cm}}
\pgfpathlineto{\pgfqpoint{1.976cm}{1.94cm}}
\pgfpathlineto{\pgfqpoint{0cm}{1.94cm}}
\pgfpathclose
\pgfusepath{clip}
\begin{pgfscope}
\begin{pgfscope}
\pgfsetdash{}{0cm}
\pgfsetlinewidth{0.818mm}
\pgfsetroundcap
\pgfsetroundjoin
\pgfsetmiterlimit{7.0}
\definecolor{eps2pgf_color}{gray}{0}\pgfsetstrokecolor{eps2pgf_color}\pgfsetfillcolor{eps2pgf_color}
\pgfpathmoveto{\pgfqpoint{0.117cm}{1.815cm}}
\pgfpathlineto{\pgfqpoint{0.682cm}{1.065cm}}
\pgfpathlineto{\pgfqpoint{1.246cm}{1.815cm}}
\pgfusepath{stroke}
\end{pgfscope}
\definecolor{eps2pgf_color}{gray}{0}\pgfsetstrokecolor{eps2pgf_color}\pgfsetfillcolor{eps2pgf_color}
\pgfpathmoveto{\pgfqpoint{0.273cm}{1.789cm}}
\pgfpathcurveto{\pgfqpoint{0.273cm}{1.825cm}}{\pgfqpoint{0.259cm}{1.86cm}}{\pgfqpoint{0.233cm}{1.886cm}}
\pgfpathcurveto{\pgfqpoint{0.207cm}{1.912cm}}{\pgfqpoint{0.173cm}{1.926cm}}{\pgfqpoint{0.137cm}{1.926cm}}
\pgfpathcurveto{\pgfqpoint{0.1cm}{1.926cm}}{\pgfqpoint{0.066cm}{1.912cm}}{\pgfqpoint{0.04cm}{1.886cm}}
\pgfpathcurveto{\pgfqpoint{0.014cm}{1.86cm}}{\pgfqpoint{0cm}{1.825cm}}{\pgfqpoint{0cm}{1.789cm}}
\pgfpathcurveto{\pgfqpoint{0cm}{1.753cm}}{\pgfqpoint{0.014cm}{1.718cm}}{\pgfqpoint{0.04cm}{1.692cm}}
\pgfpathcurveto{\pgfqpoint{0.066cm}{1.667cm}}{\pgfqpoint{0.1cm}{1.652cm}}{\pgfqpoint{0.137cm}{1.652cm}}
\pgfpathcurveto{\pgfqpoint{0.173cm}{1.652cm}}{\pgfqpoint{0.207cm}{1.667cm}}{\pgfqpoint{0.233cm}{1.692cm}}
\pgfpathcurveto{\pgfqpoint{0.259cm}{1.718cm}}{\pgfqpoint{0.273cm}{1.753cm}}{\pgfqpoint{0.273cm}{1.789cm}}
\pgfusepath{fill}
\pgfpathmoveto{\pgfqpoint{1.345cm}{1.765cm}}
\pgfpathcurveto{\pgfqpoint{1.345cm}{1.801cm}}{\pgfqpoint{1.331cm}{1.836cm}}{\pgfqpoint{1.305cm}{1.862cm}}
\pgfpathcurveto{\pgfqpoint{1.28cm}{1.887cm}}{\pgfqpoint{1.245cm}{1.902cm}}{\pgfqpoint{1.209cm}{1.902cm}}
\pgfpathcurveto{\pgfqpoint{1.172cm}{1.902cm}}{\pgfqpoint{1.138cm}{1.887cm}}{\pgfqpoint{1.112cm}{1.862cm}}
\pgfpathcurveto{\pgfqpoint{1.087cm}{1.836cm}}{\pgfqpoint{1.072cm}{1.801cm}}{\pgfqpoint{1.072cm}{1.765cm}}
\pgfpathcurveto{\pgfqpoint{1.072cm}{1.728cm}}{\pgfqpoint{1.087cm}{1.694cm}}{\pgfqpoint{1.112cm}{1.668cm}}
\pgfpathcurveto{\pgfqpoint{1.138cm}{1.642cm}}{\pgfqpoint{1.172cm}{1.628cm}}{\pgfqpoint{1.209cm}{1.628cm}}
\pgfpathcurveto{\pgfqpoint{1.245cm}{1.628cm}}{\pgfqpoint{1.28cm}{1.642cm}}{\pgfqpoint{1.305cm}{1.668cm}}
\pgfpathcurveto{\pgfqpoint{1.331cm}{1.694cm}}{\pgfqpoint{1.345cm}{1.728cm}}{\pgfqpoint{1.345cm}{1.765cm}}
\pgfusepath{fill}
\begin{pgfscope}
\pgfsetdash{}{0cm}
\pgfsetlinewidth{0.818mm}
\pgfsetroundcap
\pgfsetroundjoin
\pgfsetmiterlimit{7.0}
\pgfpathmoveto{\pgfqpoint{0.682cm}{1.065cm}}
\pgfpathlineto{\pgfqpoint{1.246cm}{0.315cm}}
\pgfpathlineto{\pgfqpoint{1.811cm}{1.065cm}}
\pgfusepath{stroke}
\end{pgfscope}
\pgfpathmoveto{\pgfqpoint{1.948cm}{1.065cm}}
\pgfpathcurveto{\pgfqpoint{1.948cm}{1.101cm}}{\pgfqpoint{1.933cm}{1.136cm}}{\pgfqpoint{1.907cm}{1.162cm}}
\pgfpathcurveto{\pgfqpoint{1.882cm}{1.187cm}}{\pgfqpoint{1.847cm}{1.202cm}}{\pgfqpoint{1.811cm}{1.202cm}}
\pgfpathcurveto{\pgfqpoint{1.775cm}{1.202cm}}{\pgfqpoint{1.74cm}{1.187cm}}{\pgfqpoint{1.714cm}{1.162cm}}
\pgfpathcurveto{\pgfqpoint{1.689cm}{1.136cm}}{\pgfqpoint{1.674cm}{1.101cm}}{\pgfqpoint{1.674cm}{1.065cm}}
\pgfpathcurveto{\pgfqpoint{1.674cm}{1.029cm}}{\pgfqpoint{1.689cm}{0.994cm}}{\pgfqpoint{1.714cm}{0.968cm}}
\pgfpathcurveto{\pgfqpoint{1.74cm}{0.942cm}}{\pgfqpoint{1.775cm}{0.928cm}}{\pgfqpoint{1.811cm}{0.928cm}}
\pgfpathcurveto{\pgfqpoint{1.847cm}{0.928cm}}{\pgfqpoint{1.882cm}{0.942cm}}{\pgfqpoint{1.907cm}{0.968cm}}
\pgfpathcurveto{\pgfqpoint{1.933cm}{0.994cm}}{\pgfqpoint{1.948cm}{1.029cm}}{\pgfqpoint{1.948cm}{1.065cm}}
\pgfusepath{fill}
\begin{pgfscope}
\pgfsetdash{}{0cm}
\pgfsetlinewidth{0.818mm}
\pgfsetmiterlimit{7.0}
\pgfpathmoveto{\pgfqpoint{1.246cm}{0.315cm}}
\pgfpathlineto{\pgfqpoint{1.244cm}{1.061cm}}
\pgfusepath{stroke}
\end{pgfscope}
\pgfpathmoveto{\pgfqpoint{1.38cm}{1.065cm}}
\pgfpathcurveto{\pgfqpoint{1.38cm}{1.101cm}}{\pgfqpoint{1.366cm}{1.136cm}}{\pgfqpoint{1.34cm}{1.162cm}}
\pgfpathcurveto{\pgfqpoint{1.315cm}{1.187cm}}{\pgfqpoint{1.28cm}{1.202cm}}{\pgfqpoint{1.244cm}{1.202cm}}
\pgfpathcurveto{\pgfqpoint{1.207cm}{1.202cm}}{\pgfqpoint{1.173cm}{1.187cm}}{\pgfqpoint{1.147cm}{1.162cm}}
\pgfpathcurveto{\pgfqpoint{1.121cm}{1.136cm}}{\pgfqpoint{1.107cm}{1.101cm}}{\pgfqpoint{1.107cm}{1.065cm}}
\pgfpathcurveto{\pgfqpoint{1.107cm}{1.029cm}}{\pgfqpoint{1.121cm}{0.994cm}}{\pgfqpoint{1.147cm}{0.968cm}}
\pgfpathcurveto{\pgfqpoint{1.173cm}{0.942cm}}{\pgfqpoint{1.207cm}{0.928cm}}{\pgfqpoint{1.244cm}{0.928cm}}
\pgfpathcurveto{\pgfqpoint{1.28cm}{0.928cm}}{\pgfqpoint{1.315cm}{0.942cm}}{\pgfqpoint{1.34cm}{0.968cm}}
\pgfpathcurveto{\pgfqpoint{1.366cm}{0.994cm}}{\pgfqpoint{1.38cm}{1.029cm}}{\pgfqpoint{1.38cm}{1.065cm}}
\pgfusepath{fill}
\begin{pgfscope}
\pgfsetdash{}{0cm}
\pgfsetlinewidth{0.818mm}
\pgfsetmiterlimit{4.0}
\pgfpathmoveto{\pgfqpoint{1.383cm}{0.178cm}}
\pgfpathcurveto{\pgfqpoint{1.383cm}{0.214cm}}{\pgfqpoint{1.369cm}{0.249cm}}{\pgfqpoint{1.343cm}{0.275cm}}
\pgfpathcurveto{\pgfqpoint{1.317cm}{0.3cm}}{\pgfqpoint{1.283cm}{0.315cm}}{\pgfqpoint{1.246cm}{0.315cm}}
\pgfpathcurveto{\pgfqpoint{1.21cm}{0.315cm}}{\pgfqpoint{1.175cm}{0.3cm}}{\pgfqpoint{1.15cm}{0.275cm}}
\pgfpathcurveto{\pgfqpoint{1.124cm}{0.249cm}}{\pgfqpoint{1.11cm}{0.214cm}}{\pgfqpoint{1.11cm}{0.178cm}}
\pgfpathcurveto{\pgfqpoint{1.11cm}{0.141cm}}{\pgfqpoint{1.124cm}{0.107cm}}{\pgfqpoint{1.15cm}{0.081cm}}
\pgfpathcurveto{\pgfqpoint{1.175cm}{0.055cm}}{\pgfqpoint{1.21cm}{0.041cm}}{\pgfqpoint{1.246cm}{0.041cm}}
\pgfpathcurveto{\pgfqpoint{1.283cm}{0.041cm}}{\pgfqpoint{1.317cm}{0.055cm}}{\pgfqpoint{1.343cm}{0.081cm}}
\pgfpathcurveto{\pgfqpoint{1.369cm}{0.107cm}}{\pgfqpoint{1.383cm}{0.141cm}}{\pgfqpoint{1.383cm}{0.178cm}}
\pgfusepath{stroke}
\end{pgfscope}
\end{pgfscope}
\end{pgfscope}
\end{pgfscope}
\end{tikzpicture}}}}
\newcommand{^{\!\resizebox{!}{.8em}{
\begin{tikzpicture}
\pgfpathmoveto{\pgfqpoint{0cm}{-0.035cm}}
\pgfpathlineto{\pgfqpoint{1.976cm}{-0.035cm}}
\pgfpathlineto{\pgfqpoint{1.976cm}{1.94cm}}
\pgfpathlineto{\pgfqpoint{0cm}{1.94cm}}
\pgfpathclose
\pgfusepath{clip}
\begin{pgfscope}
\begin{pgfscope}
\pgfpathmoveto{\pgfqpoint{0cm}{-0.035cm}}
\pgfpathlineto{\pgfqpoint{1.976cm}{-0.035cm}}
\pgfpathlineto{\pgfqpoint{1.976cm}{1.94cm}}
\pgfpathlineto{\pgfqpoint{0cm}{1.94cm}}
\pgfpathclose
\pgfusepath{clip}
\begin{pgfscope}
\begin{pgfscope}
\pgfsetdash{}{0cm}
\pgfsetlinewidth{0.818mm}
\pgfsetroundcap
\pgfsetroundjoin
\pgfsetmiterlimit{7.0}
\definecolor{eps2pgf_color}{gray}{0}\pgfsetstrokecolor{eps2pgf_color}\pgfsetfillcolor{eps2pgf_color}
\pgfpathmoveto{\pgfqpoint{0.117cm}{1.815cm}}
\pgfpathlineto{\pgfqpoint{0.682cm}{1.065cm}}
\pgfpathlineto{\pgfqpoint{1.246cm}{1.815cm}}
\pgfusepath{stroke}
\end{pgfscope}
\definecolor{eps2pgf_color}{gray}{0}\pgfsetstrokecolor{eps2pgf_color}\pgfsetfillcolor{eps2pgf_color}
\pgfpathmoveto{\pgfqpoint{0.273cm}{1.789cm}}
\pgfpathcurveto{\pgfqpoint{0.273cm}{1.825cm}}{\pgfqpoint{0.259cm}{1.86cm}}{\pgfqpoint{0.233cm}{1.886cm}}
\pgfpathcurveto{\pgfqpoint{0.207cm}{1.912cm}}{\pgfqpoint{0.173cm}{1.926cm}}{\pgfqpoint{0.137cm}{1.926cm}}
\pgfpathcurveto{\pgfqpoint{0.1cm}{1.926cm}}{\pgfqpoint{0.066cm}{1.912cm}}{\pgfqpoint{0.04cm}{1.886cm}}
\pgfpathcurveto{\pgfqpoint{0.014cm}{1.86cm}}{\pgfqpoint{0cm}{1.825cm}}{\pgfqpoint{0cm}{1.789cm}}
\pgfpathcurveto{\pgfqpoint{0cm}{1.753cm}}{\pgfqpoint{0.014cm}{1.718cm}}{\pgfqpoint{0.04cm}{1.692cm}}
\pgfpathcurveto{\pgfqpoint{0.066cm}{1.667cm}}{\pgfqpoint{0.1cm}{1.652cm}}{\pgfqpoint{0.137cm}{1.652cm}}
\pgfpathcurveto{\pgfqpoint{0.173cm}{1.652cm}}{\pgfqpoint{0.207cm}{1.667cm}}{\pgfqpoint{0.233cm}{1.692cm}}
\pgfpathcurveto{\pgfqpoint{0.259cm}{1.718cm}}{\pgfqpoint{0.273cm}{1.753cm}}{\pgfqpoint{0.273cm}{1.789cm}}
\pgfusepath{fill}
\begin{pgfscope}
\pgfsetdash{}{0cm}
\pgfsetlinewidth{0.818mm}
\pgfsetmiterlimit{7.0}
\pgfpathmoveto{\pgfqpoint{0.682cm}{1.065cm}}
\pgfpathlineto{\pgfqpoint{0.679cm}{1.812cm}}
\pgfusepath{stroke}
\end{pgfscope}
\pgfpathmoveto{\pgfqpoint{0.815cm}{1.793cm}}
\pgfpathcurveto{\pgfqpoint{0.815cm}{1.829cm}}{\pgfqpoint{0.801cm}{1.864cm}}{\pgfqpoint{0.775cm}{1.89cm}}
\pgfpathcurveto{\pgfqpoint{0.75cm}{1.915cm}}{\pgfqpoint{0.715cm}{1.93cm}}{\pgfqpoint{0.679cm}{1.93cm}}
\pgfpathcurveto{\pgfqpoint{0.643cm}{1.93cm}}{\pgfqpoint{0.608cm}{1.915cm}}{\pgfqpoint{0.582cm}{1.89cm}}
\pgfpathcurveto{\pgfqpoint{0.557cm}{1.864cm}}{\pgfqpoint{0.542cm}{1.829cm}}{\pgfqpoint{0.542cm}{1.793cm}}
\pgfpathcurveto{\pgfqpoint{0.542cm}{1.756cm}}{\pgfqpoint{0.557cm}{1.722cm}}{\pgfqpoint{0.582cm}{1.696cm}}
\pgfpathcurveto{\pgfqpoint{0.608cm}{1.67cm}}{\pgfqpoint{0.643cm}{1.656cm}}{\pgfqpoint{0.679cm}{1.656cm}}
\pgfpathcurveto{\pgfqpoint{0.715cm}{1.656cm}}{\pgfqpoint{0.75cm}{1.67cm}}{\pgfqpoint{0.775cm}{1.696cm}}
\pgfpathcurveto{\pgfqpoint{0.801cm}{1.722cm}}{\pgfqpoint{0.815cm}{1.756cm}}{\pgfqpoint{0.815cm}{1.793cm}}
\pgfusepath{fill}
\pgfpathmoveto{\pgfqpoint{1.345cm}{1.765cm}}
\pgfpathcurveto{\pgfqpoint{1.345cm}{1.801cm}}{\pgfqpoint{1.331cm}{1.836cm}}{\pgfqpoint{1.305cm}{1.862cm}}
\pgfpathcurveto{\pgfqpoint{1.28cm}{1.887cm}}{\pgfqpoint{1.245cm}{1.902cm}}{\pgfqpoint{1.209cm}{1.902cm}}
\pgfpathcurveto{\pgfqpoint{1.172cm}{1.902cm}}{\pgfqpoint{1.138cm}{1.887cm}}{\pgfqpoint{1.112cm}{1.862cm}}
\pgfpathcurveto{\pgfqpoint{1.087cm}{1.836cm}}{\pgfqpoint{1.072cm}{1.801cm}}{\pgfqpoint{1.072cm}{1.765cm}}
\pgfpathcurveto{\pgfqpoint{1.072cm}{1.728cm}}{\pgfqpoint{1.087cm}{1.694cm}}{\pgfqpoint{1.112cm}{1.668cm}}
\pgfpathcurveto{\pgfqpoint{1.138cm}{1.642cm}}{\pgfqpoint{1.172cm}{1.628cm}}{\pgfqpoint{1.209cm}{1.628cm}}
\pgfpathcurveto{\pgfqpoint{1.245cm}{1.628cm}}{\pgfqpoint{1.28cm}{1.642cm}}{\pgfqpoint{1.305cm}{1.668cm}}
\pgfpathcurveto{\pgfqpoint{1.331cm}{1.694cm}}{\pgfqpoint{1.345cm}{1.728cm}}{\pgfqpoint{1.345cm}{1.765cm}}
\pgfusepath{fill}
\begin{pgfscope}
\pgfsetdash{}{0cm}
\pgfsetlinewidth{0.818mm}
\pgfsetroundcap
\pgfsetroundjoin
\pgfsetmiterlimit{7.0}
\pgfpathmoveto{\pgfqpoint{0.682cm}{1.065cm}}
\pgfpathlineto{\pgfqpoint{1.246cm}{0.315cm}}
\pgfpathlineto{\pgfqpoint{1.811cm}{1.065cm}}
\pgfusepath{stroke}
\end{pgfscope}
\pgfpathmoveto{\pgfqpoint{1.948cm}{1.065cm}}
\pgfpathcurveto{\pgfqpoint{1.948cm}{1.101cm}}{\pgfqpoint{1.933cm}{1.136cm}}{\pgfqpoint{1.907cm}{1.162cm}}
\pgfpathcurveto{\pgfqpoint{1.882cm}{1.187cm}}{\pgfqpoint{1.847cm}{1.202cm}}{\pgfqpoint{1.811cm}{1.202cm}}
\pgfpathcurveto{\pgfqpoint{1.775cm}{1.202cm}}{\pgfqpoint{1.74cm}{1.187cm}}{\pgfqpoint{1.714cm}{1.162cm}}
\pgfpathcurveto{\pgfqpoint{1.689cm}{1.136cm}}{\pgfqpoint{1.674cm}{1.101cm}}{\pgfqpoint{1.674cm}{1.065cm}}
\pgfpathcurveto{\pgfqpoint{1.674cm}{1.029cm}}{\pgfqpoint{1.689cm}{0.994cm}}{\pgfqpoint{1.714cm}{0.968cm}}
\pgfpathcurveto{\pgfqpoint{1.74cm}{0.942cm}}{\pgfqpoint{1.775cm}{0.928cm}}{\pgfqpoint{1.811cm}{0.928cm}}
\pgfpathcurveto{\pgfqpoint{1.847cm}{0.928cm}}{\pgfqpoint{1.882cm}{0.942cm}}{\pgfqpoint{1.907cm}{0.968cm}}
\pgfpathcurveto{\pgfqpoint{1.933cm}{0.994cm}}{\pgfqpoint{1.948cm}{1.029cm}}{\pgfqpoint{1.948cm}{1.065cm}}
\pgfusepath{fill}
\begin{pgfscope}
\pgfsetdash{}{0cm}
\pgfsetlinewidth{0.818mm}
\pgfsetmiterlimit{4.0}
\pgfpathmoveto{\pgfqpoint{1.383cm}{0.178cm}}
\pgfpathcurveto{\pgfqpoint{1.383cm}{0.214cm}}{\pgfqpoint{1.369cm}{0.249cm}}{\pgfqpoint{1.343cm}{0.275cm}}
\pgfpathcurveto{\pgfqpoint{1.317cm}{0.3cm}}{\pgfqpoint{1.283cm}{0.315cm}}{\pgfqpoint{1.246cm}{0.315cm}}
\pgfpathcurveto{\pgfqpoint{1.21cm}{0.315cm}}{\pgfqpoint{1.175cm}{0.3cm}}{\pgfqpoint{1.15cm}{0.275cm}}
\pgfpathcurveto{\pgfqpoint{1.124cm}{0.249cm}}{\pgfqpoint{1.11cm}{0.214cm}}{\pgfqpoint{1.11cm}{0.178cm}}
\pgfpathcurveto{\pgfqpoint{1.11cm}{0.141cm}}{\pgfqpoint{1.124cm}{0.107cm}}{\pgfqpoint{1.15cm}{0.081cm}}
\pgfpathcurveto{\pgfqpoint{1.175cm}{0.055cm}}{\pgfqpoint{1.21cm}{0.041cm}}{\pgfqpoint{1.246cm}{0.041cm}}
\pgfpathcurveto{\pgfqpoint{1.283cm}{0.041cm}}{\pgfqpoint{1.317cm}{0.055cm}}{\pgfqpoint{1.343cm}{0.081cm}}
\pgfpathcurveto{\pgfqpoint{1.369cm}{0.107cm}}{\pgfqpoint{1.383cm}{0.141cm}}{\pgfqpoint{1.383cm}{0.178cm}}
\pgfusepath{stroke}
\end{pgfscope}
\end{pgfscope}
\end{pgfscope}
\end{pgfscope}
\end{tikzpicture}}}}[1]{#1^{\!\resizebox{!}{.8em}{
\begin{tikzpicture}
\pgfpathmoveto{\pgfqpoint{0cm}{-0.035cm}}
\pgfpathlineto{\pgfqpoint{1.976cm}{-0.035cm}}
\pgfpathlineto{\pgfqpoint{1.976cm}{1.94cm}}
\pgfpathlineto{\pgfqpoint{0cm}{1.94cm}}
\pgfpathclose
\pgfusepath{clip}
\begin{pgfscope}
\begin{pgfscope}
\pgfpathmoveto{\pgfqpoint{0cm}{-0.035cm}}
\pgfpathlineto{\pgfqpoint{1.976cm}{-0.035cm}}
\pgfpathlineto{\pgfqpoint{1.976cm}{1.94cm}}
\pgfpathlineto{\pgfqpoint{0cm}{1.94cm}}
\pgfpathclose
\pgfusepath{clip}
\begin{pgfscope}
\begin{pgfscope}
\pgfsetdash{}{0cm}
\pgfsetlinewidth{0.818mm}
\pgfsetroundcap
\pgfsetroundjoin
\pgfsetmiterlimit{7.0}
\definecolor{eps2pgf_color}{gray}{0}\pgfsetstrokecolor{eps2pgf_color}\pgfsetfillcolor{eps2pgf_color}
\pgfpathmoveto{\pgfqpoint{0.117cm}{1.815cm}}
\pgfpathlineto{\pgfqpoint{0.682cm}{1.065cm}}
\pgfpathlineto{\pgfqpoint{1.246cm}{1.815cm}}
\pgfusepath{stroke}
\end{pgfscope}
\definecolor{eps2pgf_color}{gray}{0}\pgfsetstrokecolor{eps2pgf_color}\pgfsetfillcolor{eps2pgf_color}
\pgfpathmoveto{\pgfqpoint{0.273cm}{1.789cm}}
\pgfpathcurveto{\pgfqpoint{0.273cm}{1.825cm}}{\pgfqpoint{0.259cm}{1.86cm}}{\pgfqpoint{0.233cm}{1.886cm}}
\pgfpathcurveto{\pgfqpoint{0.207cm}{1.912cm}}{\pgfqpoint{0.173cm}{1.926cm}}{\pgfqpoint{0.137cm}{1.926cm}}
\pgfpathcurveto{\pgfqpoint{0.1cm}{1.926cm}}{\pgfqpoint{0.066cm}{1.912cm}}{\pgfqpoint{0.04cm}{1.886cm}}
\pgfpathcurveto{\pgfqpoint{0.014cm}{1.86cm}}{\pgfqpoint{0cm}{1.825cm}}{\pgfqpoint{0cm}{1.789cm}}
\pgfpathcurveto{\pgfqpoint{0cm}{1.753cm}}{\pgfqpoint{0.014cm}{1.718cm}}{\pgfqpoint{0.04cm}{1.692cm}}
\pgfpathcurveto{\pgfqpoint{0.066cm}{1.667cm}}{\pgfqpoint{0.1cm}{1.652cm}}{\pgfqpoint{0.137cm}{1.652cm}}
\pgfpathcurveto{\pgfqpoint{0.173cm}{1.652cm}}{\pgfqpoint{0.207cm}{1.667cm}}{\pgfqpoint{0.233cm}{1.692cm}}
\pgfpathcurveto{\pgfqpoint{0.259cm}{1.718cm}}{\pgfqpoint{0.273cm}{1.753cm}}{\pgfqpoint{0.273cm}{1.789cm}}
\pgfusepath{fill}
\begin{pgfscope}
\pgfsetdash{}{0cm}
\pgfsetlinewidth{0.818mm}
\pgfsetmiterlimit{7.0}
\pgfpathmoveto{\pgfqpoint{0.682cm}{1.065cm}}
\pgfpathlineto{\pgfqpoint{0.679cm}{1.812cm}}
\pgfusepath{stroke}
\end{pgfscope}
\pgfpathmoveto{\pgfqpoint{0.815cm}{1.793cm}}
\pgfpathcurveto{\pgfqpoint{0.815cm}{1.829cm}}{\pgfqpoint{0.801cm}{1.864cm}}{\pgfqpoint{0.775cm}{1.89cm}}
\pgfpathcurveto{\pgfqpoint{0.75cm}{1.915cm}}{\pgfqpoint{0.715cm}{1.93cm}}{\pgfqpoint{0.679cm}{1.93cm}}
\pgfpathcurveto{\pgfqpoint{0.643cm}{1.93cm}}{\pgfqpoint{0.608cm}{1.915cm}}{\pgfqpoint{0.582cm}{1.89cm}}
\pgfpathcurveto{\pgfqpoint{0.557cm}{1.864cm}}{\pgfqpoint{0.542cm}{1.829cm}}{\pgfqpoint{0.542cm}{1.793cm}}
\pgfpathcurveto{\pgfqpoint{0.542cm}{1.756cm}}{\pgfqpoint{0.557cm}{1.722cm}}{\pgfqpoint{0.582cm}{1.696cm}}
\pgfpathcurveto{\pgfqpoint{0.608cm}{1.67cm}}{\pgfqpoint{0.643cm}{1.656cm}}{\pgfqpoint{0.679cm}{1.656cm}}
\pgfpathcurveto{\pgfqpoint{0.715cm}{1.656cm}}{\pgfqpoint{0.75cm}{1.67cm}}{\pgfqpoint{0.775cm}{1.696cm}}
\pgfpathcurveto{\pgfqpoint{0.801cm}{1.722cm}}{\pgfqpoint{0.815cm}{1.756cm}}{\pgfqpoint{0.815cm}{1.793cm}}
\pgfusepath{fill}
\pgfpathmoveto{\pgfqpoint{1.345cm}{1.765cm}}
\pgfpathcurveto{\pgfqpoint{1.345cm}{1.801cm}}{\pgfqpoint{1.331cm}{1.836cm}}{\pgfqpoint{1.305cm}{1.862cm}}
\pgfpathcurveto{\pgfqpoint{1.28cm}{1.887cm}}{\pgfqpoint{1.245cm}{1.902cm}}{\pgfqpoint{1.209cm}{1.902cm}}
\pgfpathcurveto{\pgfqpoint{1.172cm}{1.902cm}}{\pgfqpoint{1.138cm}{1.887cm}}{\pgfqpoint{1.112cm}{1.862cm}}
\pgfpathcurveto{\pgfqpoint{1.087cm}{1.836cm}}{\pgfqpoint{1.072cm}{1.801cm}}{\pgfqpoint{1.072cm}{1.765cm}}
\pgfpathcurveto{\pgfqpoint{1.072cm}{1.728cm}}{\pgfqpoint{1.087cm}{1.694cm}}{\pgfqpoint{1.112cm}{1.668cm}}
\pgfpathcurveto{\pgfqpoint{1.138cm}{1.642cm}}{\pgfqpoint{1.172cm}{1.628cm}}{\pgfqpoint{1.209cm}{1.628cm}}
\pgfpathcurveto{\pgfqpoint{1.245cm}{1.628cm}}{\pgfqpoint{1.28cm}{1.642cm}}{\pgfqpoint{1.305cm}{1.668cm}}
\pgfpathcurveto{\pgfqpoint{1.331cm}{1.694cm}}{\pgfqpoint{1.345cm}{1.728cm}}{\pgfqpoint{1.345cm}{1.765cm}}
\pgfusepath{fill}
\begin{pgfscope}
\pgfsetdash{}{0cm}
\pgfsetlinewidth{0.818mm}
\pgfsetroundcap
\pgfsetroundjoin
\pgfsetmiterlimit{7.0}
\pgfpathmoveto{\pgfqpoint{0.682cm}{1.065cm}}
\pgfpathlineto{\pgfqpoint{1.246cm}{0.315cm}}
\pgfpathlineto{\pgfqpoint{1.811cm}{1.065cm}}
\pgfusepath{stroke}
\end{pgfscope}
\pgfpathmoveto{\pgfqpoint{1.948cm}{1.065cm}}
\pgfpathcurveto{\pgfqpoint{1.948cm}{1.101cm}}{\pgfqpoint{1.933cm}{1.136cm}}{\pgfqpoint{1.907cm}{1.162cm}}
\pgfpathcurveto{\pgfqpoint{1.882cm}{1.187cm}}{\pgfqpoint{1.847cm}{1.202cm}}{\pgfqpoint{1.811cm}{1.202cm}}
\pgfpathcurveto{\pgfqpoint{1.775cm}{1.202cm}}{\pgfqpoint{1.74cm}{1.187cm}}{\pgfqpoint{1.714cm}{1.162cm}}
\pgfpathcurveto{\pgfqpoint{1.689cm}{1.136cm}}{\pgfqpoint{1.674cm}{1.101cm}}{\pgfqpoint{1.674cm}{1.065cm}}
\pgfpathcurveto{\pgfqpoint{1.674cm}{1.029cm}}{\pgfqpoint{1.689cm}{0.994cm}}{\pgfqpoint{1.714cm}{0.968cm}}
\pgfpathcurveto{\pgfqpoint{1.74cm}{0.942cm}}{\pgfqpoint{1.775cm}{0.928cm}}{\pgfqpoint{1.811cm}{0.928cm}}
\pgfpathcurveto{\pgfqpoint{1.847cm}{0.928cm}}{\pgfqpoint{1.882cm}{0.942cm}}{\pgfqpoint{1.907cm}{0.968cm}}
\pgfpathcurveto{\pgfqpoint{1.933cm}{0.994cm}}{\pgfqpoint{1.948cm}{1.029cm}}{\pgfqpoint{1.948cm}{1.065cm}}
\pgfusepath{fill}
\begin{pgfscope}
\pgfsetdash{}{0cm}
\pgfsetlinewidth{0.818mm}
\pgfsetmiterlimit{4.0}
\pgfpathmoveto{\pgfqpoint{1.383cm}{0.178cm}}
\pgfpathcurveto{\pgfqpoint{1.383cm}{0.214cm}}{\pgfqpoint{1.369cm}{0.249cm}}{\pgfqpoint{1.343cm}{0.275cm}}
\pgfpathcurveto{\pgfqpoint{1.317cm}{0.3cm}}{\pgfqpoint{1.283cm}{0.315cm}}{\pgfqpoint{1.246cm}{0.315cm}}
\pgfpathcurveto{\pgfqpoint{1.21cm}{0.315cm}}{\pgfqpoint{1.175cm}{0.3cm}}{\pgfqpoint{1.15cm}{0.275cm}}
\pgfpathcurveto{\pgfqpoint{1.124cm}{0.249cm}}{\pgfqpoint{1.11cm}{0.214cm}}{\pgfqpoint{1.11cm}{0.178cm}}
\pgfpathcurveto{\pgfqpoint{1.11cm}{0.141cm}}{\pgfqpoint{1.124cm}{0.107cm}}{\pgfqpoint{1.15cm}{0.081cm}}
\pgfpathcurveto{\pgfqpoint{1.175cm}{0.055cm}}{\pgfqpoint{1.21cm}{0.041cm}}{\pgfqpoint{1.246cm}{0.041cm}}
\pgfpathcurveto{\pgfqpoint{1.283cm}{0.041cm}}{\pgfqpoint{1.317cm}{0.055cm}}{\pgfqpoint{1.343cm}{0.081cm}}
\pgfpathcurveto{\pgfqpoint{1.369cm}{0.107cm}}{\pgfqpoint{1.383cm}{0.141cm}}{\pgfqpoint{1.383cm}{0.178cm}}
\pgfusepath{stroke}
\end{pgfscope}
\end{pgfscope}
\end{pgfscope}
\end{pgfscope}
\end{tikzpicture}}}}
\newcommand{\CC}{\mathscr{C} \hspace{.1em}}
\newcommand{\LL}{\mathscr{L}}
\newcommand{\UU}{\mathscr{U}}
\newcommand{\VV}{\mathscr{V}}
\newcommand{\Q}{\mathscr{Q} \hspace{.2em}}
\newcommand{\rmbb}[1]{\textcolor{black}{#1}}
\newcommand{\rmb}[1]{\textcolor{blue}{#1}}
\newcommand{\rmm}[1]{\textcolor{magenta}{#1}}
\newcommand{\rmg}[1]{\textcolor{Bittersweet}{#1}}
\newcommand{\R}{\mathbb{R}}
\newcommand{\N}{\mathbb{N}}
\newcommand{\Td}{{\mathbb{T}^{d}}}
\newcommand{\Prec}{\prec\!\!\!\prec}
\newcommand{\8}{\infty}
\newcommand{\dd}{\mathrm{d}}
\DeclareMathOperator{\esssup}{esssup}
\title{Global solutions to elliptic\\ and parabolic $\Phi^4$ models in
Euclidean space.}
\author[1]{Massimiliano Gubinelli}
\author[2]{Martina Hofmanov\'a}
\affil[1]{\small Hausdorff Center for Mathematics\\ \& Institute for Applied Mathematics\\ University of Bonn\\
Endenicher Allee 60\\
53115 Bonn, Germany.  \href{mailto:gubinelli@iam.uni-bonn.de}{gubinelli@iam.uni-bonn.de} }
\affil[2]{\small Fakult\"at f\"ur Mathematik, Universit\"at Bielefeld, Postfach 10 01 31, 33501 Bielefeld, Germany. \href{mailto:hofmanova@math.uni-bielefeld.de}{hofmanova@math.uni-bielefeld.de}}
\begin{document}
\maketitle

\begin{abstract}
We prove existence of global  solutions to singular SPDEs on $\mathbb{R}^d$ with cubic nonlinearities and additive white noise perturbation, both in the elliptic setting in dimensions $d=4,5$ and in the parabolic setting for $d=2,3$.  We prove uniqueness and coming down from infinity for the parabolic equations. A  motivation for considering  these equations is the construction of scalar interacting Euclidean quantum field theories. The parabolic equations are related to the $\Phi^4_d$ Euclidean quantum field theory via  Parisi--Wu stochastic quantization, while the elliptic equations are linked to the $\Phi^4_{d-2}$ Euclidean quantum field theory via the Parisi--Sourlas dimensional reduction mechanism.  \\[.4em]
  
\noindent  \textbf{Keywords:} singular SPDEs, paracontrolled distributions, global solutions, stochastic quantization, dimensional reduction, semilinear elliptic equations.
\end{abstract}

\tableofcontents

\section{Introduction}

This paper is concerned with elliptic and parabolic partial differential equations related to the $\Phi^{4}$ Euclidean quantum field theory on the full space.
More precisely, we consider the following semilinear elliptic partial differential equation on $\R^{d}$ for $d=4,5$,
\begin{equation}\label{eq:phi4}
(-\Delta+\mu)\varphi+\varphi^{3}=\xi,
\end{equation}
where $\xi$ is a space white noise on $\R^{d}$ and $\mu>0$.
We also consider the Cauchy problem for the  semilinear parabolic partial differential equation on $\R_+\times\R^{d}$ with $d=2,3$, given by
\begin{equation}\label{eq:phi4p}
(\partial_{t}-\Delta+\mu)\varphi+\varphi^{3}=\xi,
\end{equation}
where $\xi$ is a space-time white noise on $\R_+\times\R^{d}$ and $\mu\in \R$.

Both equations fall in the category of the so-called \emph{singular} SPDEs, a loose term which means that they are classically ill-posed due to the very irregular nature of the noise $\xi$. Indeed, solutions are expected to take values only in spaces of distributions of negative regularity and the non-linear terms appearing in the equations cannot be given a canonical meaning.   Recent progresses by Hairer~\cite{hairer_theory_2014} and others~\cite{GIP,kupiainen_renormalization_2016,otto_quasilinear_2016} have provided various existence theories for local solutions of the above parabolic equations in a periodic spatial domain. The key idea is to identify suitable subspaces of distributions large enough to contain the candidate solutions and structured enough to allow for the definition of the non-linear terms.  These theories define solutions for the above equations once the non-linear term is \emph{renormalized}, which formally can be understood as a subtraction of an (infinite) correction term:
$$
\varphi^3 \mapsto \varphi^3 - \infty \varphi.
$$
More rigorously, and as we hinted above, this formal expression has to be understood in the sense that even though both terms separately are not well defined, certain combination has a well-defined meaning for a restricted class of distributions $\varphi$. The byproduct of the renormalization is that additional data (in the form of polynomials of the driving noise) have to be considered in order to identify canonically the result of the renormalization. 
It is not the main aim of this paper to discuss the features  of the local solution theory for singular SPDEs as this has been done extensively in the references cited above. 

Our aim  here is to develop a simple global solution theory for equations~\eqref{eq:phi4} and~\eqref{eq:phi4p}. Global solutions rely on specific properties of the equations, in particular here on the \emph{right} sign of the cubic non-linearity. The existence of  global in time solutions of the parabolic equation~\eqref{eq:phi4p} is relevant to the problem of \emph{stochastic quantization} of the $\Phi^4_d$ Euclidean  field theory, that is the measure $\nu$ on distributions over the $d$-dimensional periodic domain $\Lambda=\mathbb{T}^d$ formally given by the Euclidean path integral
\begin{equation}
\label{eq:phi4measure}
\nu(\mathd \phi)= \exp\left[-\int_{\Lambda} \left(\frac12|\nabla \phi|^2+\frac{\mu}{2}\phi^2+\frac14\phi^4\right) \right] \mathd \phi,
\end{equation}
where $\mathbb{T}=\R/2\pi \mathbb{Z}$.
Global in space solutions, that is solutions defined over all $\R^d$ correspond to the infinite volume limit of such a measure. Existence and uniqueness of global space-time solutions for the parabolic model in $d=2$ has been proved by Mourrat and Weber~\cite{MW17}. More recently the same authors have proven existence and uniqueness of global solutions in time on $\mathbb{T}^3$ in~\cite{MWcomedown}. In this last paper they also prove the stronger property, namely, that the solutions \emph{come down from infinity}, meaning that after a finite time the solution belongs to a compact set of the state space uniformly in the initial condition, a very strong property which is entirely due to the presence of the  cubic drift. These results show that singular SPDEs can be used to implement rigorously  the stochastic quantization approach first suggested by Parisi and Wu~\cite{parisi_perturbation_1981} and \emph{construct} random fields sampled according to the measure~\eqref{eq:phi4measure}. Another recent interesting approach which uses the SPDE to construct the measure $\nu$ is that of Albeverio and Kusuoka~\cite{albeverio_invariant_2017} which uses the invariance of approximations and uniform energy estimates on the SPDE to deduce tightness and existence of the limiting $\Phi^4_d$ measure~\eqref{eq:phi4measure}. 

In the present work we complete the picture by proving the global space-time existence and uniqueness for eq.~\eqref{eq:phi4p} in $\R^3$ with an associated \emph{coming down from infinity} property. This will be essentially a byproduct of the technique we develop to analyze the elliptic model~\eqref{eq:phi4} on $\R^d$ with $d=4,5$. The choice of dimensions has a two-fold origin: first it corresponds to the dimensions where the singularities of the elliptic equation match those of the parabolic one for $d=2,3$. Second (and partially related reason) is that there exists a very interesting conjecture of \emph{dimensional reduction} formulated first by Parisi and Sourlas~\cite{parisi_random_1979} which links the behavior of certain SPDEs in $d$ dimensions to that of Euclidean field theories in $d-2$ dimensions. In particular, it is conjectured that the trace on a codimension 2 hyperplane  of solutions to eq.~\eqref{eq:phi4} in $\R^d$ should have the law of the (parabolic) $\Phi^4_{d-2}$ model in $\R^{d-2}$, at least for $d=3,4,5$. This conjecture has been partially validated by rigorous arguments of Klein et al.~\cite{klein_supersymmetry_1983, klein_supersymmetry_1984} in the context of a regularized version of the models. Our study of the singular equation is another step to the full rigorous verification of the dimensional reduction phenomenon. The existence theory of the $d=3$ elliptic model is relatively straightforward and we will not consider it here.

Given the importance of these models in the mathematical physics literature and the open interesting conjectures they are related to, we found essential to devise streamlined arguments to treat global solutions of these equations. The main technical problem with globalization in the solution theory of singular SPDEs is given by the fact that the noise grows at infinity requiring the use of weighted spaces. This in turn requires to exploit fine properties of the equations in order to close the estimates. Witness of the important technical difficulties involved in the global analysis is the \emph{tour de force} that Mourrat and Weber~\cite{MWcomedown} had to put in place to solve the parabolic model on $\mathbb{T}^3$. One of the aim of the present paper is to provide also a simpler proof of their result, proof which is more in line with standard arguments of functional analysis/PDE theory. 
In order to do so we developed a new \emph{localization} technique which allows to split distributions belonging to weighted spaces into an irregular component which behaves nicely at the spatial infinity and a smooth component which grows in space. The localization technique allows to split singular SPDEs in two equations:
\begin{itemize}
\item[-]
 one containing the irregular terms but linear (or almost linear) and not requiring any particular care in the handling of the weighted spaces;
 \item[-] the other containing all the more regular terms and all the non-linearities which can be analyzed using standard PDE arguments, in particular pointwise maximum principle and  pointwise coercive estimates whose weighted version are easy to establish. This avoids the use of weighted $L^p$ spaces and related energy estimates which complicate the analysis of Mourrat and Weber~\cite{MWcomedown} and also of  Albeverio and Kusuoka~\cite{albeverio_invariant_2017}.
\end{itemize}
Other two improvements which we realize in this paper are the following:
\begin{itemize}
\item[a)] we use a direct $L^2$ energy estimate to establish uniqueness for the parabolic model, simplifying the proof and taking full advantage of our $L^\infty$ a priori estimates;
\item[b)] we use a time dependent weight to prove the coming down from infinity, going around the painful induction present in Mourrat and Weber paper and following quite closely the strategy one would adopt for classical driven reaction diffusion equations.
\end{itemize}
A problem which still remains open is that of the global uniqueness in the elliptic setting. Probably uniqueness does not hold or holds only for large masses. This is suggested by the behavior of the corresponding $\Phi^4_{d-2}$ model which is expected to undergo a phase transition at small temperature, corresponding here to a small mass.

\paragraph{Organization of the paper.} In Section \ref{s:prel} we introduce the basic notation and recall  various preliminary results concerning weighted Besov spaces. Then we present interpolation results  and construct the above mentioned localization operators, which are essential in the main body of the paper. As the next step, we establish Schauder and coercive estimates in weighted Besov spaces in both elliptic and parabolic setting and finally we discuss the basic results of the paracontrolled calculus.

In Section \ref{s:proba}, we recall the results of probabilistic analysis connected to the construction of the  stochastic objects needed in the sequel.

Sections \ref{sec:44}, \ref{sec:45} are devoted to the existence for the elliptic $\Phi^{4}$ model in dimension 4 and 5, respectively. More precisely, in the first step, we decompose the equations into  systems of two equations, one irregular  and the other one regular and containing the cubic nonlinearity. The next step is the cornerstone of our  analysis: we derive new a priori estimates for the unknowns of the decomposed system, which are then employed in order to establish existence of solutions. Here we first solve the equations on a large torus using a combination of a variational approach together with the Schaefer's fixed point theorem. Then we let the size of the torus converge to infinity and use compactness.

The a priori estimates from Sections \ref{sec:44}, \ref{sec:45} play the key role in the parabolic setting as well. Namely, in Sections \ref{sec:42}, \ref{sec:43} we study the parabolic $\Phi^{4}$ model in dimension 2 and 3, respectively. We follow a similar decomposition into a system of equations (only with a slight modification in dimension 3) and derive parabolic a priori estimates in analogy to the elliptic situation. These bounds are then used in the proof of existence. However, we proceed differently than in the elliptic setting: we work directly on the full space and mollify the noise, which leads to existence of smooth approximate solutions. The uniform estimates together with a compactness argument allow us to pass to the limit.

In Section \ref{s:uniq} we establish uniqueness of solutions in the parabolic setting. Unlike in the previous sections, it is not enough to work in the $L^{\infty}$-scale of weighted Besov spaces with polynomial weights. In particular, to compensate for the loss of weight in our estimates we employ exponential weights, requiring a different definition of the associated Besov spaces. This is discussed in Section \ref{ss:b}. The  proof of uniqueness then uses solely energy-type estimates in the $L^{2}$-scale of Besov spaces which  takes the full advantage of the well-chosen space-time weight.

Section \ref{s:d} is then concerned with the coming down from infinity property. Here we work with an additional weight in time which vanishes at zero and therefore allows to obtain bounds independent of the initial condition. Such a weight requires careful Schauder and coercive estimates that are established in Sections \ref{ss:d1}, \ref{ss:d2}. The proof of the coming down from infinity  then relies on our approach to a priori estimates from Section \ref{sec:44}, \ref{sec:45} together with a delicate control of the behavior at zero.

Finally, in Appendix \ref{s:a} we collect certain auxiliary results concerning existence for elliptic and parabolic variants of our problem in the smooth setting. Appendix \ref{s:sch} is then devoted to a refined Schauder estimate needed in Section \ref{s:d}.

We point out that for didactic reasons and in order not to blur our arguments, we chose to include in  Section \ref{s:prel}  only the results needed for the existence in Sections ~\ref{sec:44}, \ref{sec:45}, \ref{sec:42}, \ref{sec:43}. Further generalizations are  needed for uniqueness in Section \ref{s:uniq} and for the coming down from infinity in Section \ref{s:d}. The corresponding preliminaries are then discussed directly in the respective sections.

\paragraph{Acknowledgement.} MG is partially supported by the German Research Foundation (DFG) via CRC 1060.

\section{Preliminaries}
\label{s:prel}

\subsection{Weighted Besov spaces}
\label{ssec:besov}

As the first step, we introduce weighted Besov spaces which will be used in the sequel. Recall that the collection of  admissible weight functions is the collection of all positive $C^\infty(\mathbb{R}^d)$ functions $\rho$ with the following properties:
\begin{enumerate}
\item For all $\gamma\in\mathbb{N}_0^d$ there is a positive constant $c_\gamma$ with
$$
|D^\gamma\rho(x)|\leqslant c_\gamma \rho(x),\qquad\text{for all }x\in\mathbb{R}^d.
$$
\item There are two constants $c>0$ and $b\geq 0$ such that
$$
0<\rho(x)\leqslant c\rho(y)(1+|x-y|^2)^{b/2},\qquad\text{for all }x,y\in\mathbb{R}^d.
$$
\end{enumerate}

The space of Schwartz functions on $\R^{d}$ is denoted by $\mathcal{S}(\R^{d})$ and its dual, the space of tempered distributions is $\mathcal{S}'(\R^{d})$. The Fourier transform of $u\in \mathcal{S}'(\R^{d})$ is given by
$$
\mathcal{F}u(z)=\int_{\R^{d}}u(x)e^{-i z\cdot x}\,\dd x,
$$
so that the inverse Fourier transform is given by $\mathcal{F}^{-1}u(x)=(2\pi)^{-d}\mathcal{F}u(-x).$
 By $(\Delta_{i})_{i\geq -1}$ we denote the Littlewood--Paley blocks corresponding to a dyadic partition of unity.
If $\rho$ is an admissible weight and $\alpha\in\mathbb{R}$, we define the weighted Besov space $B^\alpha_{\infty,\infty}(\rho)\backassign \CC^\alpha(\rho)$ as the collection of all $f\in\mathcal{S}'(\R^{d})$ with finite norm
\[ \| f \|_{\CC^{\alpha} (\rho)} = \sup_{i\geq -1} 2^{i \alpha} \| \Delta_i f
   \|_{L^{\infty} (\rho)} = \sup_{i\geq -1} 2^{i \alpha} \| \rho \Delta_i f
   \|_{L^{\infty}} . \]
More details can be found  e.g. in \cite{T06}. 
Particularly,  due to \cite[Theorem 6.5]{T06}, it holds true that
\begin{equation}\label{eq:1}
   \|f\|_{\CC^\alpha(\rho)}\sim\|\rho f\|_{\CC^\alpha}
\end{equation}
   in the sense of equivalence of norms, where the latter denotes the norm in the classical (unweighted) Besov space $\CC^\alpha=B^\alpha_{\infty,\infty}(\R^{d})$. Moreover, it was shown in \cite[Theorem 6.9]{T06} that for $\alpha\in(0,M)$ with $M\in\N$, the weighted space $\CC^\alpha(\rho)$ admits an equivalent norm given by
\begin{equation}\label{eq:2h}
\|f\|_{L^\infty(\rho)}+\sup_{0<|h|\leqslant 1}|h|^{-\alpha}\|\Delta^M_h f\|_{L^\infty(\rho)},
\end{equation}
where $\Delta^M_h$ is the $M^{th}$-order finite difference operator  defined inductively by
$$
(\Delta^1_hf)(x)=f(x+h)-f(x),\qquad (\Delta^{\ell+1}_h)f(x)=\Delta^1_h(\Delta^{\ell}_hf)(x),\qquad\ell\in\N,
$$
Introduce a partition of unity  $\sum_{m\in\mathbb{Z}^{d}} \Lambda_m = 1$, where $\Lambda_{m}(x):=\Lambda(x-m)$ for a compactly supported $C^{\8}$-function $\Lambda$ on $\R^{d}$ and $m\in\mathbb{Z}^{d}$. Then the following localization principle for weighted Besov spaces follows from \eqref{eq:1} and \cite[Theorem 2.4.7]{T92}: let $\alpha\in\R$ then
\begin{equation}\label{eq:lem2.1}
   \|f\|_{\CC^\alpha(\rho)}\sim\sup_{m\in\mathbb{Z}^{d}}\|\Lambda_{m} f\|_{\CC^\alpha(\rho)}
\end{equation}
holds true in the sense of equivalence of norms. For most of our purposes, the following result in the case $\alpha>0$ will be sufficient. Let  $\sum_{k\geq-1} w_k = 1$ be a smooth partition of unity in spherical dyadic slices
where $w_{-1}$ is supported in a ball containing zero and there exists an annulus $\mathcal{A}=\{x\in\R^{d};a\leq|x|\leq b\}$ for some $0<a<b$ such that each $w_k$ for $k\geq0$ is supported in the annulus $2^k\mathcal{A}$. Set
$\tilde{w}_k = \sum_{\substack{i\geq-1\\i \sim k}} w_i$, where we write $i\sim k$ provided $\mathrm{supp}\,  w_{i}\cap \mathrm{supp}\, w_{k}\neq \emptyset.$

\begin{lemma}\label{lem:2.1}
It holds true that
\[ \| f \|_{L^{\infty} (\rho)} \leqslant \sup_{k\geq-1} \| \tilde{w}_k f
   \|_{L^{\infty} (\rho)}, \]
and if $\alpha>0$ then also
   $$
   \| f \|_{\CC^{\alpha} (\rho)} \lesssim\sup_{k\geq-1} \| \tilde{w}_k f
   \|_{\CC^{\alpha} (\rho)}.
   $$
\end{lemma}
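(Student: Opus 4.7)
Both inequalities reduce to the defining property of the partition $\{w_k\}$. For the $L^{\infty}(\rho)$ bound, the key observation is pointwise: fix $x$ and pick $k$ such that $w_k(x)>0$. Then any other $w_i$ with $w_i(x)\neq 0$ satisfies $i\sim k$ (their supports meet), and since $\sum_i w_i\equiv 1$ one gets $\tilde w_k(x)=\sum_{i\sim k}w_i(x)=\sum_i w_i(x)=1$. Therefore $f(x)=(\tilde w_k f)(x)$ and
\[
\rho(x)|f(x)|=\rho(x)|(\tilde w_k f)(x)|\leq\|\tilde w_k f\|_{L^\infty(\rho)}\leq\sup_{k\geq -1}\|\tilde w_k f\|_{L^\infty(\rho)};
\]
taking the supremum over $x$ gives the first claim.

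For the H\"older case, I would work with the equivalent norm \eqref{eq:2h}, choosing an integer $M>\alpha$, so that I need to control $|h|^{-\alpha}\rho(x)|\Delta^M_h f(x)|$ uniformly for $|h|\leq 1$. The strategy is to find, for each $x$, an index $k$ for which $\tilde w_k\equiv 1$ on the whole ``stencil'' $\{x+jh:j=0,\dots,M\}$. Because $\mathrm{supp}(w_k)\subset 2^k\mathcal{A}$ and $i\sim k$ means only $|i-k|\leq c$ for a bounded $c$ (depending on $\mathcal{A}$), the set $\{\tilde w_k\equiv 1\}$ is an annulus of width $\gtrsim 2^k$ for $k$ large; since $|jh|\leq M$ is fixed, the stencil fits inside it for all $k\geq k_0$. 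For $x$ in the bounded low-frequency region we appeal to a finite number of low indices; one may either adjust the partition so that $w_{-1}$ absorbs a sufficiently large ball and $\tilde w_{-1}\equiv 1$ on an $M$-enlargement of $\mathrm{supp}(w_{-1})$, or invoke the localization principle \eqref{eq:lem2.1} restricted to a fixed compact set, on which only finitely many $\tilde w_k$'s are relevant.

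On the stencil we then have $f\equiv \tilde w_k f$, so $\Delta^M_h f(x)=\Delta^M_h(\tilde w_k f)(x)$, which yields
\[
|h|^{-\alpha}\rho(x)|\Delta^M_h f(x)|\leq|h|^{-\alpha}\|\Delta^M_h(\tilde w_k f)\|_{L^\infty(\rho)}\lesssim\|\tilde w_k f\|_{\mathscr{C}^\alpha(\rho)}.
\]
Taking the supremum over $x$, $h$ and combining with the $L^\infty(\rho)$ bound from the first part, the equivalent norm \eqref{eq:2h} delivers the second inequality (here the implicit constant absorbs the equivalence of norms).

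\textbf{Main obstacle.} The only delicate point is the uniformity in $x$ of the choice of $k$ such that the full stencil $B(x,M)$ lies in $\{\tilde w_k\equiv 1\}$. High frequencies are handled by the $2^k$-scaling of the flat-top annulus, but the transition at the lowest indices needs a slightly more careful book-keeping -- either a mild enlargement of $w_{-1}$ or a separate appeal to \eqref{eq:lem2.1} on a bounded ball, both of which only affect the implicit constant in $\lesssim$.
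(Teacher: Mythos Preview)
Your proposal is correct and follows essentially the same route as the paper. The only notable difference is a small bookkeeping trick: instead of working with $|h|\le 1$ and stencils of diameter $\le M$, the paper observes that one may restrict to $0<|h|<1/M$ in the equivalent norm \eqref{eq:2h}, so that the stencil $\{x,x+h,\dots,x+Mh\}$ always lies in the unit ball $B(x,1)$; this removes your worry about ``sufficiently large $k$'' versus the low-frequency region, since one then only needs the single claim that for every $x$ there is some $k$ with $\tilde w_k\equiv 1$ on $B(x,1)$ --- exactly the pointwise observation you made for the $L^\infty$ part, extended to a neighbourhood. Your discussion of the obstacle is accurate, and the paper's solution is precisely this $|h|<1/M$ restriction rather than a separate low/high-$k$ case split.
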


\begin{proof}
Due to the construction of $(\tilde w_k)_{k\geq -1}$, for every $x\in\R^d$ there exists $k\geq -1$ such that $f(y)=\tilde{w}_k(y)f(y)$ for all $y\in \R^d$ with $|x-y|<1$. Consequently, the first claim follows. To show the second one, let $M\in\N$ be the smallest integer such that $\alpha<M$.
Then, it can be observed that, in addition to \eqref{eq:2h}, also 
$$
\| f \|_{L^{\infty} (\rho)} +\sup_{0<|h|< \frac{1}{M}}h^{-\alpha}\|\Delta^M_h f\|_{L^\infty(\rho)}
$$
defines an equivalent norm on $\CC^\alpha(\rho)$. The first summand is estimated as in the previous step. For the second summand, consider $|h|<\frac{1}{M}$. Since $(\Delta^M_hf)(x)$ depends only on  values  $f(y)$ for  $|y-x|\leqslant M|h|<1$, we deduce that for every $x\in \R^d$ there exists $k\in\N_0$ such that $f(y)=\tilde w_k(y)f(y)$ whenever $|y-x|<1$ and consequently also $(\Delta^M_hf)(y)=(\Delta^M_h(\tilde w_kf))(y)$. Thus
$$
\sup_{0<|h|< \frac{1}{M}}h^{-\alpha}\|\Delta^M_h f\|_{L^\infty(\rho)}\leqslant \sup_{k\geq-1}\sup_{0<|h|< \frac{1}{M}}h^{-\alpha}\|\Delta^M_h (\tilde w_kf)\|_{L^\infty(\rho)}
$$
and the second claim follows.
\end{proof}

Throughout this paper $\rho$ stands for a weight which is admissible and either constant or decreasing at infinity. It depends only on the space variable in the case of elliptic problems  or on both space and time for parabolic equations. We will not repeat the word ``admissible'' in the sequel. Moreover,  we will often work with  polynomial weights of the form $\rho(x)=\langle x\rangle^{-\nu}$ where  $\langle x\rangle=(1+|x|^{2})^{1/2}$ and $\nu\geq 0$. In the same spirit we will consider space-time dependent polynomial weights or $\rho(t,x)=\langle (t,x)\rangle^{-\nu}=(1+|(t,x)|^{2})^{-\nu/2}$ for $\nu\geq 0$. In addition, certain non-admissible weights will be needed in Section \ref{s:uniq} and Section \ref{s:d}. Namely, the proof of uniqueness in Section \ref{s:uniq} employs a weight that vanishes exponentially at infinity and consequently the definition of the associated Besov spaces cannot be based on Schwartz functions but rather on the so-called Gevrey classes as discussed in \cite{MW17}. The coming down from infinity property in Section~\ref{s:d} then requires a weight in time that vanishes in zero and is therefore also not an admissible weight in the sense of the above definition. The necessary results for these particular weights are discussed in Section~\ref{ss:b} and Sections \ref{ss:d0},  \ref{ss:d1}, \ref{ss:d2}.

Let $\rho$ be a polynomial space-dependent weight. Then the following embedding holds true
\begin{equation}\label{eq:emb}
\CC^{\beta_1}(\rho^{\gamma_1})\subset \CC^{\beta_2}(\rho^{\gamma_2}) \quad\text{provided}\quad \beta_1\geq\beta_2,\ \gamma_1\leqslant \gamma_2,
\end{equation}
and, according to \cite[Theorem 6.31]{T06}, the embedding in \eqref{eq:emb} is compact provided  $ \beta_1>\beta_2$ and $\gamma_1< \gamma_2$.

For parabolic equations, we will also need weighted   function spaces of space-time dependent functions/distributions. Let $\rho$ be a polynomial space-time weight and $\alpha\in\R$ and denote $\rho_{t}(\cdot)=\rho(t,\cdot)$, $t\in[0,\8)$.  Then $C\CC^{\alpha}(\rho)$ is the space of space-time distributions $f$ that are continuous in time, satisfy  $f(t)\in\CC^{\alpha}(\rho_{t})$ for every $t\in[0,\infty)$, and have finite norm
\[
\|f\|_{C\CC^{\alpha}(\rho)}:=\sup_{t\geq 0} \|(\rho f)(t)\|_{\CC^{\alpha}}.
\]
If a mapping $f:[0,\infty)\to  \CC^{\alpha}(\rho_{0}) $ is only bounded but not continuous, we write $f\in L^{\infty}\CC^{\alpha}(\rho)$ with the norm
\[
\|f\|_{L^{\infty}\CC^{\alpha}(\rho)}:=\esssup_{t\geq 0} \|(\rho f)(t)\|_{\CC^{\alpha}}<\8.
\]
Time regularity will be measured in terms of classical H\"older norms. In particular, for $\alpha\in(0,1)$ and $\beta\in\R$ we denote by $C^{\alpha}\CC^{\beta}(\rho)$  the space of mappings $f:[0,\infty)\to  \CC^{\beta}(\rho_{0}) $ with finite norm
$$
\|f\|_{C^{\alpha}\CC^{\beta}(\rho)}:=\sup_{t\geq 0} \|(\rho f)(t)\|_{\CC^{\beta}}+\sup_{s,t\geq 0,s\neq t} \frac{\|(\rho f)(t)-(\rho f)(s)\|_{\CC^{\beta}}}{|t-s|^{\alpha}}.
$$
It can be seen (cf. \eqref{eq:2h}) that since $\rho$ is a polynomial weight, this norm is equivalent to
\begin{equation}\label{eq:teq}
\|f\|_{C^{\alpha}\CC^{\beta}(\rho)}\sim\sup_{t\geq 0} \|(\rho f)(t)\|_{\CC^{\beta}}+\sup_{s,t\geq 0,s\neq t} \frac{\|\rho_{t}(f(t)- f(s))\|_{\CC^{\beta}}}{|t-s|^{\alpha}}.
\end{equation}
Similarly, we define the space $C^{\alpha}L^{\8}(\rho)$.

In the case we consider only a finite time interval $[0,T]$, for some $T>0$, and a time-independent weight $\rho$, we write $f\in C_{T}\CC^{\alpha}(\rho)$, $f\in L^{\infty}_{T}\CC^{\alpha}(\rho)$, $C_{T}^{\alpha}\CC^{\beta}(\rho)$ and $C_{T}^{\alpha}L^{\8}(\rho)$ with straightforward modifications in the corresponding norms.

\subsection{Interpolation}

We present a simple interpolation result for  weighted Besov spaces.

\begin{lemma}
  \label{lemma:interp}
 Let $\kappa\in (0,1)$ and let $\rho$ be a space-time weight.  We have, for any $\alpha \in [0, 2 + \kappa]$
  \[ \| \psi \|_{\CC^{\alpha} (\rho^{1 + \alpha})} \lesssim \| \psi
     \|_{L^{\infty} (\rho)}^{1 - \alpha / (2 + \kappa)} \| \psi \|_{\CC^{2 +
     \kappa} (\rho^{3 + \kappa})}^{\alpha / (2 + \kappa)} .\]
\end{lemma}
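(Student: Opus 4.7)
The strategy is a pointwise interpolation of the weight combined with the Littlewood--Paley characterization of $\CC^\alpha(\rho^{1+\alpha})$. The starting observation is purely arithmetic: setting $\theta = \alpha/(2+\kappa) \in [0,1]$, one has the identity
\[
1+\alpha = (1-\theta)\cdot 1 + \theta\cdot(3+\kappa),
\]
so that $\rho^{1+\alpha} = \rho^{1-\theta}\,(\rho^{3+\kappa})^\theta$ pointwise. Consequently, for any $x\in\mathbb{R}^d$ and any $i\geq -1$,
\[
\rho(x)^{1+\alpha}|\Delta_i\psi(x)| = \bigl(\rho(x)|\Delta_i\psi(x)|\bigr)^{1-\theta}\bigl(\rho(x)^{3+\kappa}|\Delta_i\psi(x)|\bigr)^{\theta},
\]
which, after taking the supremum over $x$, yields
\[
\|\rho^{1+\alpha}\Delta_i\psi\|_{L^\infty} \leq \|\rho\,\Delta_i\psi\|_{L^\infty}^{1-\theta}\,\|\rho^{3+\kappa}\Delta_i\psi\|_{L^\infty}^{\theta}.
\]

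Next, I multiply by $2^{i\alpha}$. Since $\alpha = \theta(2+\kappa)$, we can rewrite $2^{i\alpha} = (2^{i(2+\kappa)})^{\theta}\cdot 1^{1-\theta}$ and distribute the weights accordingly:
\[
2^{i\alpha}\|\rho^{1+\alpha}\Delta_i\psi\|_{L^\infty} \leq \|\rho\,\Delta_i\psi\|_{L^\infty}^{1-\theta}\,\bigl(2^{i(2+\kappa)}\|\rho^{3+\kappa}\Delta_i\psi\|_{L^\infty}\bigr)^{\theta}.
\]
The second factor is bounded by $\|\psi\|_{\CC^{2+\kappa}(\rho^{3+\kappa})}^{\theta}$ by definition of the weighted Besov norm in Section~\ref{ssec:besov}. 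For the first factor, I invoke the standard fact that the Littlewood--Paley projectors are uniformly bounded on $L^\infty(\rho)$ for admissible weights, i.e.\ $\|\Delta_i\psi\|_{L^\infty(\rho)} \lesssim \|\psi\|_{L^\infty(\rho)}$ uniformly in $i\geq -1$ (this follows from the admissibility bound $\rho(x)\lesssim \rho(y)(1+|x-y|^2)^{b/2}$ combined with the rapid decay of the convolution kernels defining $\Delta_i$).

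Combining these bounds and taking the supremum over $i\geq -1$ produces
\[
\|\psi\|_{\CC^\alpha(\rho^{1+\alpha})} = \sup_{i\geq -1} 2^{i\alpha}\|\rho^{1+\alpha}\Delta_i\psi\|_{L^\infty} \lesssim \|\psi\|_{L^\infty(\rho)}^{1-\theta}\,\|\psi\|_{\CC^{2+\kappa}(\rho^{3+\kappa})}^{\theta},
\]
which is exactly the stated inequality. The only step that requires any care is the uniform boundedness of $\Delta_i$ on $L^\infty(\rho)$; this is the technical ingredient that allows the weights to be interpolated cleanly, and it is where the admissibility of $\rho$ is used. Everything else is a direct consequence of the weight identity $1+\alpha=(1-\theta)+\theta(3+\kappa)$ and H\"older's inequality at the pointwise level.
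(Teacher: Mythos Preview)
Your proof is correct and follows essentially the same approach as the paper: both use the pointwise weight identity $\rho^{1+\alpha}=\rho^{1-\theta}(\rho^{3+\kappa})^\theta$ with $\theta=\alpha/(2+\kappa)$, apply H\"older at the level of each Littlewood--Paley block, invoke the uniform boundedness $\|\Delta_i\psi\|_{L^\infty(\rho)}\lesssim\|\psi\|_{L^\infty(\rho)}$, and then take the supremum over $i$.
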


\begin{proof}
 It holds
  \[ \|  \Delta_k \psi \|_{L^{\infty} (\rho^{1 +
     \alpha})} \lesssim \|  \rho^{1 + \alpha} \Delta_k \psi
     \|_{L^{\infty}} \lesssim \|  \rho \Delta_k \psi
     \|_{L^{\infty}}^{1 - \alpha / (2 + \kappa)} \|  \rho^{3 +
     \kappa} \Delta_k \psi \|_{L^{\infty}}^{\alpha / (2 + \kappa)} \]
  \[ \lesssim \| \psi \|_{L^{\infty} (\rho)}^{1 - \alpha /
     (2 + \kappa)} \| \Delta_k \psi \|_{L^{\infty}
     (\rho^{3 + \kappa})}^{\alpha / (2 + \kappa)} \lesssim 2^{- \alpha k} \|
\psi \|_{ L^{\infty} (\rho)}^{1 - \alpha / (2 + \kappa)} \|
 \psi \|_{\CC^{2 + \kappa} (\rho^{3 +
     \kappa})}^{\alpha / (2 + \kappa)} \]
  which proves the  claim.
\end{proof}

We will also need the following version  adapted to time-dependent problems.

\begin{lemma}
  \label{lemma:interp2}
Let $\kappa\in (0,1)$ and let $\rho$ be a space-time weight.  We have, for any $\alpha \in [0, 2 + \kappa]$
  \[ \| \psi \|_{C\CC^{\alpha} (\rho^{1 + \alpha})} \lesssim \| \psi
     \|_{L^{\8}L^{\infty} (\rho)}^{1 - \alpha / (2 + \kappa)} \| \psi \|_{C\CC^{2 +
     \kappa} (\rho^{3 + \kappa})}^{\alpha / (2 + \kappa)} .\]
Moreover, if  $\alpha/2\notin\N_{0}$ then
       \[ \| \psi \|_{C^{\alpha/2}L^{\8} (\rho^{1 + \alpha})} \lesssim \| \psi
     \|_{L^{\8}L^{\infty} (\rho)}^{1 - \alpha / (2 + \kappa)} \| \psi \|_{C^{(2+\kappa)/2}_{b}L^{\8} (\rho^{3 + \kappa})}^{\alpha / (2 + \kappa)} .\]
\end{lemma}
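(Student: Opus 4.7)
The proof should mirror the pointwise Hölder-interpolation strategy used in Lemma \ref{lemma:interp}, but lifted to the space-time setting and supplemented with a time-difference interpolation for the second bound.

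For the first estimate, I will apply Lemma \ref{lemma:interp} pointwise in time. For each fixed $t\geq 0$, with the spatial weight $\rho_{t}:=\rho(t,\cdot)$, one obtains
\[
\|\psi(t)\|_{\CC^{\alpha}(\rho_t^{1+\alpha})}\lesssim \|\psi(t)\|_{L^{\infty}(\rho_t)}^{1-\alpha/(2+\kappa)}\|\psi(t)\|_{\CC^{2+\kappa}(\rho_t^{3+\kappa})}^{\alpha/(2+\kappa)}.
\]
Taking the supremum over $t$ and using $ab\leq (\sup a)^{1-\theta}(\sup b)^\theta$ with $\theta=\alpha/(2+\kappa)$ yields the $L^{\infty}\CC^{\alpha}$-bound. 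Continuity of $t\mapsto \psi(t)\in\CC^{\alpha}(\rho^{1+\alpha})$ is automatic from $\psi\in C\CC^{2+\kappa}(\rho^{3+\kappa})$ and the embedding \eqref{eq:emb}, since $2+\kappa\geq \alpha$ and $3+\kappa\geq 1+\alpha$.

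For the second estimate, I will exploit the equivalent norm \eqref{eq:teq}, for which it suffices to control $\|\rho_t^{1+\alpha}(\psi(t)-\psi(s))\|_{L^\infty}/|t-s|^{\alpha/2}$ uniformly in $s\neq t$. Using the algebraic identities $\rho_t^{1+\alpha}=\rho_t^{1-\theta}\rho_t^{(3+\kappa)\theta}$ and $|t-s|^{\alpha/2}=|t-s|^{(2+\kappa)\theta/2}$, we factor pointwise
\[
\frac{\rho_t^{1+\alpha}(x)|\psi(t,x)-\psi(s,x)|}{|t-s|^{\alpha/2}}=\bigl[\rho_t(x)|\psi(t,x)-\psi(s,x)|\bigr]^{1-\theta}\left[\frac{\rho_t^{3+\kappa}(x)|\psi(t,x)-\psi(s,x)|}{|t-s|^{(2+\kappa)/2}}\right]^{\theta}.
\]
The second factor is bounded by $\|\psi\|_{C^{(2+\kappa)/2}_{b}L^{\infty}(\rho^{3+\kappa})}$ thanks to \eqref{eq:teq}. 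For the first factor, I split $|\psi(t,x)-\psi(s,x)|\leq |\psi(t,x)|+|\psi(s,x)|$ and use admissibility of $\rho$ in space-time to obtain $\rho_t(x)\lesssim \rho_s(x)(1+|t-s|^{2})^{b/2}$; combined with the $L^{\infty}L^{\infty}(\rho)$ bound on each term and the outer factor $|t-s|^{-\alpha/2}$, the resulting estimate closes provided one treats separately the regimes $|t-s|\leq 1$ and $|t-s|>1$ (in the latter the bound $|t-s|^{-\alpha/2}\leq 1$ combined with the trivial pointwise bound suffices). The $L^{\infty}$ part of the norm is handled exactly as in the first assertion.

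The assumption $\alpha/2\notin \N_{0}$ is used only to guarantee that $C^{\alpha/2}L^{\infty}(\rho^{1+\alpha})$ coincides with a genuinely fractional Hölder space, avoiding the degenerate cases $\alpha=0$ (trivial) and $\alpha=2$. The main technical obstacle is the control of the weight ratio $\rho_t(x)/\rho_s(x)$ in the first factor, which requires careful use of admissibility together with the case split in $|t-s|$; once this is in place, the remainder is pure algebra from the Hölder-type inequality and the two interpolation exponents.
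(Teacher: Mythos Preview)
Your argument for the first inequality is correct and coincides with the paper's: apply Lemma~\ref{lemma:interp} at each fixed time and take the supremum.

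For the second inequality there is a genuine gap. Your pointwise factorization produces the factor
\[
\frac{\rho_t^{\,3+\kappa}(x)\,|\psi(t,x)-\psi(s,x)|}{|t-s|^{(2+\kappa)/2}},
\]
and you claim it is controlled by $\|\psi\|_{C^{(2+\kappa)/2}_{b}L^{\infty}(\rho^{3+\kappa})}$ via \eqref{eq:teq}. But $(2+\kappa)/2>1$, while \eqref{eq:teq} (and the definition preceding it) is stated only for H\"older exponents in $(0,1)$. A function in $C^{(2+\kappa)/2}$ with $(2+\kappa)/2>1$ satisfies $|\psi(t)-\psi(s)|\lesssim |t-s|$ at best, so your second factor blows up like $|t-s|^{-\kappa/2}$ as $|t-s|\to 0$. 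The same issue hits the left-hand side when $\alpha\in(2,2+\kappa]$: then $\alpha/2>1$ and the $C^{\alpha/2}$ seminorm is not captured by first increments either.

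The paper avoids this by noting that for $\alpha/2\notin\N_0$ the H\"older space $C^{\alpha/2}$ coincides with the Besov space $B^{\alpha/2}_{\infty,\infty}$, extending $\psi$ to $\R\times\R^d$, and then running the Littlewood--Paley interpolation of Lemma~\ref{lemma:interp} in the time variable; this works uniformly across all regularities. Your increment approach can be repaired by replacing first differences with second differences $\Delta_h^2\psi$ throughout (these characterise both $C^{\alpha/2}$ and $C^{(2+\kappa)/2}$ for the relevant non-integer exponents, cf.\ \eqref{eq:2h}), but as written the key step does not go through.
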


\begin{proof}
The first claim is a straightforward modification of Lemma \ref{lemma:interp}.
The second one can be obtained by the same approach since for $\alpha/2\notin\N_{0}$ the H\"older space $C^{\alpha/2}$ can be identified with the Besov space $B^{\alpha/2}_{\8,\8}$ and functions in $C^{\alpha/2}L^{\8}(\rho)$ can be naturally extended to be defined on the full space $\R\times\R^{d}$ while preserving the same norm.
\end{proof}

\subsection{Localization operators}
\label{ssec:local}

Here we construct localization operators $\UU_{>},
\UU_{\leqslant}$ which play the key role in our analysis. These localizers allow to decompose  a distribution $f$ into a sum of two components: one belongs to a (weighted) Besov space of higher regularity whereas the other one is less regular. To this end, let  $\sum_{k\geq-1} w_k = 1$ be a smooth dyadic  partition of unity on $\R^{d}$
where $w_{-1}$ is supported in a ball containing zero and there exists an annulus $\mathcal{A}=\{x\in\R^{d};a\leq|x|\leq b\}$ for some $0<a<b$ such that each $w_k$ for $k\geq0$ is supported in the annulus $2^k\mathcal{A}$.
Let $(L_{k})_{k\geq -1}\subset [-1,\infty)$ be a sequence of real numbers and let $f\in\mathcal{S}'(\R^{d})$. We define the localization operators by
\[ \UU_{>} f = \sum_k w_k \Delta_{> L_k} f, \qquad
   \UU_{\leqslant} f = \sum_k w_k \Delta_{\leqslant L_k} f, \]
where $\Delta_{>L_{k}}=\sum_{j:j>L_{k}}\Delta_{j}$ and $\Delta_{\leq L_{k}}=\sum_{j:j\leq L_{k}}\Delta_{j}$. We point out that in the sequel, we will use various localizing sequence $(L_{k})_{k\geq -1}$, depending on the context. However, for notational simplicity, we
will not denote these operators by different symbols.

\begin{lemma}\label{lem:local}
\rmbb{Let $L>0$ be given. There exists a  choice of parameters $(L_k)_{k\geq -1}$ such that for all $\alpha,\delta,\gamma>0$ and $a,b\in\R$ it holds true
      \[ \| \UU_{>} f \|_{\CC^{-\alpha - \delta} (\rho^{- a})} \lesssim 2^{- \delta L} \| f \|_{\CC^{-\alpha}
     (\rho^{-a+\delta})}, \qquad \| \UU_{\leqslant} f \|_{\CC^{-\alpha+\gamma}
     (\rho^{b})} \lesssim 2^{\gamma	
     L} \| f \|_{\CC^{-\alpha} (\rho^{b-\gamma})}, \]
          where the proportionality constant depends on $\alpha,\delta,\gamma,a,b$ but is independent of $f$.}
\end{lemma}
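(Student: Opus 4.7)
The plan is to take $L_k = L + qk$ (with $L_{-1}$ capped below by $-1$), where $q\geq 0$ is any constant such that $\rho(x)\sim 2^{-qk}$ on $\mathrm{supp}\,w_k\subset 2^k\mathcal{A}$; such $q$ exists since the weights of interest in this paper are either constant or of polynomial type, and more generally the admissibility property forces $\rho$ to have at most polynomial behaviour. With this calibration the algebra balances: on $\mathrm{supp}\,w_k$ the weight ratio $\rho^{-a}/\rho^{-a+\delta}=\rho^{-\delta}$ is of order $2^{qk\delta}$, so combining it with the frequency-projection gain $2^{-L_k\delta}=2^{-L\delta}\cdot 2^{-qk\delta}$ produces the uniform factor $2^{-\delta L}$; the symmetric bookkeeping for $\UU_{\leq}$ yields $2^{\gamma L}$ from $2^{L_k\gamma}\cdot 2^{-qk\gamma}$.

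To turn this into a rigorous proof I would proceed in three steps. First, invoke the unit-cube localization \eqref{eq:lem2.1} to reduce the target bound to
$\sup_{m\in\Z^d}\|\Lambda_m\UU_{>}f\|_{\CC^{-\alpha-\delta}(\rho^{-a})}\lesssim 2^{-\delta L}\|f\|_{\CC^{-\alpha}(\rho^{-a+\delta})}$,
and for each $m$ only $O(1)$ indices $k$ with $\Lambda_mw_k\not\equiv 0$ contribute, all satisfying $2^k\sim\langle m\rangle$. Second, establish the weighted Bernstein-type estimate
$$\|\Delta_{>L_k}f\|_{\CC^{-\alpha-\delta}(\rho^\sigma)}\lesssim 2^{-L_k\delta}\|f\|_{\CC^{-\alpha}(\rho^\sigma)},$$
which is immediate from $\|h\|_{\CC^{s}(\rho^\sigma)}=\sup_j 2^{sj}\|\rho^\sigma\Delta_jh\|_{L^\infty}$ combined with $\Delta_j\Delta_{>L_k}f=\Delta_jf$ for $j>L_k+1$ and $=0$ for $j<L_k-1$. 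Third, exchange the weight exponents via the factorization $\rho^{-a}\Lambda_mw_k = 2^{qk\delta}\varphi_{k,m}\rho^{-a+\delta}$ with $\varphi_{k,m}:=2^{-qk\delta}\rho^{-\delta}\Lambda_mw_k$, and apply the standard Besov multiplier bound $\|\varphi g\|_{\CC^s}\lesssim\|\varphi\|_{\CC^{|s|+1}}\|g\|_{\CC^s}$; the step closes because $\varphi_{k,m}$ is uniformly bounded in $\CC^M$ for every $M\geq 0$.

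The main obstacle is this last uniformity: although the function $2^{-qk\delta}\rho^{-\delta}$ by itself grows without bound at infinity, the product $\varphi_{k,m}$ is supported on a bounded set where the rescaled weight $2^{qk}\rho$ is of order $1$; admissibility of $\rho$ (i.e., $|D^\gamma\rho|\lesssim\rho$) together with the scaling of $w_k$ then yields the required uniform derivative control through the Leibniz rule. Once this point is established the three steps combine routinely to give the first estimate; the proof of the second is entirely symmetric, using the low-frequency bound $\|\Delta_{\leq L_k}f\|_{\CC^{-\alpha+\gamma}(\rho^\sigma)}\lesssim 2^{\gamma L_k}\|f\|_{\CC^{-\alpha}(\rho^\sigma)}$ and the analogous factorization $\rho^{b}\Lambda_mw_k = 2^{-qk\gamma}\psi_{k,m}\rho^{b-\gamma}$.
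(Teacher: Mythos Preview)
Your proposal is correct and follows essentially the same route as the paper: reduce via the unit-cube localization~\eqref{eq:lem2.1} to a supremum over finitely many $k$, gain $2^{-L_k\delta}$ from the high-frequency cut, and transfer the weight exponent using that $\rho$ is essentially constant on $\mathrm{supp}\,w_k$. The only cosmetic difference is that the paper sets $L_k = L + c_k$ with $c_k := -\log_2\|w_k\|_{L^\infty(\rho)}$ (intrinsic to $\rho$, no need to identify the rate $q$) and packages your Step~3 as a single application of the weighted product estimate $\|w_k\, g\|_{\CC^{-\alpha-\delta}(\rho^{-a})}\lesssim \|w_k\|_{\CC^\beta(\rho^{-\delta})}\|g\|_{\CC^{-\alpha-\delta}(\rho^{-a+\delta})}$ together with $\|w_k\|_{\CC^\beta(\rho^{-\delta})}\simeq 2^{\delta c_k}$, which is exactly your uniform $\CC^M$-bound on $\varphi_{k,m}$ in disguise.
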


\begin{proof}
  Denote $c_k = - \log_2 \|  w_k \|_{L^{\infty} (\rho)}$ and let $\beta>\alpha+\delta$. Then we have
  \[ \|  w_k \|_{\CC^{\beta} (\rho^{\gamma})} \simeq \|  w_k \|_{L^{\infty}
     (\rho)}^{\gamma} = 2^{- \gamma c_k}, \]
  \[ \|  w_k \|_{\CC^{\beta} (\rho^{- \delta})} \simeq \|  w_k
     \|_{L^{\infty} (\rho)}^{- \delta} = 2^{\delta c_k} . \]
According to \eqref{eq:lem2.1}    and since there exists $M\in\N$ such that for every $m\in\mathbb{Z}^{d}$  the support of $\Lambda_{m}$ intersects the support of $w_{k}$ only for $k\in A_{m}$, where $A_{m}$ in a set of cardinality at most $M$, it holds 
  \[ \| \UU_{>} f \|_{\CC^{-\alpha - \delta} (\rho^{- a})} \lesssim
     \sup_{m\in\mathbb{Z}^{d}}\|\Lambda_{m}\UU_{>}f\|_{\CC^{ -\alpha - \delta} (\rho^{- a})}\lesssim \sup_{m\in\mathbb{Z}^{d}}\|\Lambda_{m}\sum_{k\in A_{m}}w_{k}\Delta_{>L_{k}}f\|_{\CC^{-\alpha - \delta} (\rho^{- a})}\]
 \[  \lesssim_{M} \sup_{k}\|  w_k \Delta_{> L_k} f \|_{\CC^{ -\alpha - \delta} (\rho^{- a})}
     \lesssim \sup_k \|  w_k \|_{\CC^{\beta} (\rho^{-\delta})}
     \| \Delta_{> L_k} f \|_{\CC^{ -\alpha - \delta} (\rho^{-a+\delta})} \]
  \[ \lesssim \sup_k 2^{\delta c_k - \delta L_k} \| f \|_{\CC^{- \alpha}
     (\rho^{-a+\delta})} \lesssim 2^{- \delta L} \| f \|_{\CC^{- \alpha}
     (\rho^{-a+\delta})}, \]
where we set $L_k =  c_k+ L$.
  On the other hand, the same argument implies
  \[ \| \UU_{\leqslant} f \|_{\CC^{-\alpha+\gamma} (\rho^b)} \lesssim \sup_k
     \|  w_k \Delta_{\leqslant L_k} f \|_{\CC^{-\alpha+\gamma} (\rho^b)} \lesssim
     \sup_k \|  w_k \|_{\CC^{\beta} (\rho^{\gamma})} \| \Delta_{\leqslant
     L_k} f \|_{\CC^{-\alpha+\gamma} (\rho^{b-\gamma})} \]
  \[ \lesssim \sup_k 2^{\gamma L_k - \gamma c_k} \| f
     \|_{\CC^{-\alpha} (\rho^{b-\gamma})} \lesssim 2^{\gamma L} \| f \|_{\CC^{- \alpha}
     (\rho^{b-\gamma})}.\]
\end{proof}

\begin{remark}
     Note that the sequence $(L_k)_{k\geq -1}$ in Lemma \ref{lem:local} does not depend on any of the parameters $\alpha,\delta,\kappa,a,b$ nor on the function $f$.

\end{remark}

We will also need the following version adapted to time-dependent problems. Let $(v_{\ell})_{\ell\geq -1}$ be a smooth dyadic partition of unity on $[0,\8)$ such that  $v_{-1}$ is supported in a ball containing zero and there exists an annulus $\mathcal{A}=\{t\in[0,\infty);a\leq t\leq b\}$ for some $0<a<b$ such that each $v_\ell$ for $\ell\geq0$ is supported in the annulus $2^k\mathcal{A}$. 
Let $\tilde v_{\ell}=\sum_{i:i\sim\ell}v_{i}$.
 For a given sequence $(L_{k,\ell})_{k,\ell\geq -1}$  we define localization operators $\VV_{>},
\VV_{\leqslant}$  by
\begin{equation}\label{eq:locd}
\VV_{>} f = \sum_{k,\ell} v_{\ell}w_k \Delta_{> L_{k,\ell}} f, \qquad
   \VV_{\leqslant} f = \sum_{k,\ell} v_{\ell}w_k \Delta_{\leqslant L_{k,\ell}} f.
   \end{equation}

\begin{lemma}\label{lem:local2}
Let $L>0$ be given and  let $\rho$ be a space-time weight. There exists a  choice of parameters $(L_{k,\ell})_{k,\ell\geq -1}$ such that for all $\alpha,\delta,\gamma>0$ and $a,b\in\R$ it holds true
      \[ \| \VV_{>} f \|_{C\CC^{- \alpha - \delta} (\rho^{- a})} \lesssim 2^{- \delta L} \| f \|_{C\CC^{- \alpha}
     (\rho^{-a+\delta})}, \qquad \| \VV_{\leqslant} f \|_{C\CC^{-\alpha+\gamma}
     (\rho^{b})} \lesssim 2^{\gamma
     L} \| f \|_{C\CC^{- \alpha} (\rho^{b-\gamma})}, \]
          where the proportionality constant depends on $\alpha,\delta,\kappa,a,b$ but is independent of $f$.
\end{lemma}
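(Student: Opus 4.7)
The approach is to run the argument of Lemma~\ref{lem:local} pointwise in $t$, using the doubly-indexed partition of unity $\{v_\ell w_k\}_{k,\ell \geq -1}$ to encode the joint space-time localization of the weight $\rho$. First I would fix $t \in [0,\infty)$ and note that, by the dyadic structure of $(v_\ell)_{\ell \geq -1}$, at most a constant number of indices $\ell$ satisfy $v_\ell(t) \neq 0$. Combining this with the spatial localization principle \eqref{eq:lem2.1} applied to $(\VV_> f)(t) = \sum_{k,\ell} v_\ell(t)\, w_k\, \Delta_{> L_{k,\ell}} f(t)$, the problem reduces to the pointwise-in-time bound
\[
\|(\VV_> f)(t)\|_{\CC^{-\alpha-\delta}(\rho_t^{-a})} \lesssim \sup_{k,\ell} \bigl\| v_\ell(t)\, w_k\, \Delta_{> L_{k,\ell}} f(t) \bigr\|_{\CC^{-\alpha-\delta}(\rho_t^{-a})}.
\]

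Next I would introduce the space-time analogue of the sequence $c_k$ from Lemma~\ref{lem:local} by setting $c_{k,\ell} := -\log_2 \|v_\ell w_k\|_{L^\infty(\rho)}$; since $\rho$ is admissible and $v_\ell w_k$ is supported where both $t$ and $|x|$ live on a single dyadic scale, $\rho$ is slowly varying on that support and comparable to $2^{-c_{k,\ell}}$, whence $\|v_\ell w_k\|_{\CC^\beta(\rho^{-\delta})} \lesssim 2^{\delta c_{k,\ell}}$ and $\|v_\ell w_k\|_{\CC^\beta(\rho^\gamma)} \lesssim 2^{-\gamma c_{k,\ell}}$ for any $\beta > \alpha + \max(\delta,\gamma)$. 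I would then apply a standard weighted paraproduct/product estimate in the $x$-variable to obtain
\[
\bigl\| v_\ell(t)\, w_k\, \Delta_{> L_{k,\ell}} f(t) \bigr\|_{\CC^{-\alpha-\delta}(\rho_t^{-a})} \lesssim 2^{\delta c_{k,\ell}}\, \bigl\|\Delta_{> L_{k,\ell}} f(t)\bigr\|_{\CC^{-\alpha-\delta}(\rho_t^{-a+\delta})},
\]
and choose $L_{k,\ell} := c_{k,\ell} + L$ (independently of $t$), so that the Littlewood--Paley gain $2^{-\delta L_{k,\ell}}$ from $\Delta_{>L_{k,\ell}}$ cancels the factor $2^{\delta c_{k,\ell}}$, leaving precisely $2^{-\delta L}$. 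The second inequality follows by the symmetric computation with $\gamma$ in place of $\delta$ and $\Delta_{\leqslant L_{k,\ell}}$ in place of $\Delta_{>L_{k,\ell}}$.

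Finally I would take the supremum in $t$ to upgrade the pointwise estimate to the $C\CC$ norm. Continuity in $t$ of $\VV_> f$ (and $\VV_\leqslant f$) is inherited from that of $f$: for each fixed $t$ the outer sum in $\ell$ is finite, and the remaining sum in $k$ converges in $\CC^{-\alpha-\delta}(\rho_t^{-a})$ with uniform bounds, so we can pass the limit under the sum. The main technical point to verify carefully — and the only nontrivial ingredient beyond the proof of Lemma~\ref{lem:local} — is that the constants implicit in $\|v_\ell w_k\|_{\CC^\beta(\rho^{\pm})} \lesssim 2^{\mp c_{k,\ell}}$ are uniform in $(k,\ell)$; this uses the admissibility of $\rho$ together with translation-invariance of the Littlewood--Paley projections and the fact that, on the support of $v_\ell w_k$, the weight $\rho$ differs from its typical value $2^{-c_{k,\ell}}$ only by a bounded multiplicative factor, exactly as in the time-independent situation.
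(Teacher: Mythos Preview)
Your proposal is correct and follows essentially the same route as the paper's proof: both define $c_{k,\ell}$ via the weighted $L^\infty$ size of the space-time bump, set $L_{k,\ell}=c_{k,\ell}+L$, and use the spatial localization principle together with a weighted product estimate exactly as in Lemma~\ref{lem:local}. The only cosmetic difference is that the paper works with the thickened time bumps $\tilde v_\ell$ and the $C\CC$ norm directly (invoking Lemma~\ref{lem:2.1}), whereas you argue pointwise in $t$ using that only finitely many $\ell$ contribute at each time; these are equivalent formulations of the same reduction.
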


\begin{proof}
Similarly to the proof of Lemma \ref{lemma:interp2} we denote $c_{k,\ell} = - \log_2 \|\tilde v_{\ell}  w_k \|_{CL^{\infty} (\rho)}$ and let $\beta>\alpha+\delta$.
  Then we have
  \[ \|\tilde v_{\ell}  w_k \|_{C\CC^{\beta} (\rho^{\gamma})} \simeq \|\tilde v_{\ell}  w_k \|_{CL^{\infty}
     (\rho)}^{\gamma} = 2^{- \gamma c_{k,\ell}}, \]
  \[ \|\tilde v_{\ell}  w_k \|_{C\CC^{\beta} (\rho^{- \delta})} \simeq \|\tilde v_{\ell}  w_k
     \|_{CL^{\infty} (\rho)}^{- \delta} = 2^{\delta c_{k,\ell}} . \]
In view of  \eqref{eq:lem2.1} and Lemma \ref{lem:2.1}, we deduce (similarly to the proof of Lemma \ref{lem:local}) that
  \[ \| \VV_{>} f \|_{C\CC^{- \alpha - \delta} (\rho^{- a})} \lesssim
     \sup_{k,\ell} \|\tilde v_{\ell}  w_k \Delta_{> L_{k,\ell}} f \|_{C\CC^{- \alpha - \delta} (\rho^{- a})}
    \]
    \[ \lesssim \sup_{k,\ell} \|\tilde v_{\ell}  w_k \|_{C\CC^{\beta} (\rho^{-\delta})}
     \| \Delta_{> L_{k,\ell}} f \|_{C\CC^{- \alpha - \delta} (\rho^{-a+\delta})} \]
  \[ \lesssim \sup_{k,\ell} 2^{\delta c_{k,\ell} - \delta L_{k,\ell}} \| f \|_{C\CC^{- \alpha}
     (\rho^{-a+\delta})} \lesssim 2^{- \delta L} \| f \|_{C\CC^{- \alpha}
     (\rho^{-a+\delta})}, \]
where we set $L_{k,\ell} =  c_{k,\ell}+ L$.
  On the other hand, it holds
  \[ \| \VV_{\leqslant} f \|_{C\CC^{-\alpha+\gamma} (\rho^b)} \lesssim \sup_{k,\ell}
     \|\tilde v_{\ell}  w_k \Delta_{\leqslant L_{k,\ell}} f \|_{C\CC^{-\alpha+\gamma} (\rho^b)} \lesssim
     \sup_{k,\ell} \|\tilde v_{\ell}  w_k \|_{C\CC^{\beta} (\rho^{\gamma})} \| \Delta_{\leqslant
     L_{k,\ell}} f \|_{C\CC^{-\alpha+\gamma} (\rho^{b-\gamma})} \]
  \[ \lesssim \sup_{k,\ell} 2^{\gamma L_{k,\ell} - \gamma c_{k,\ell}} \| f
     \|_{C\CC^{- \alpha} (\rho^{b-\gamma})} \lesssim 2^{\gamma L} \| f \|_{C\CC^{- \alpha}
     (\rho^{b-\gamma})}.\]
\end{proof}

\subsection{Elliptic Schauder estimates}
\label{s:ell1}

We proceed with Schauder estimates valid for elliptic partial differential equations with cubic nonlinearities. Throughout the paper, we denote $\Q=-\Delta+\mu$.

\begin{lemma}
  \label{lemma:schauder-ellptic}Fix $\kappa > 0$ and let $\psi \in \CC^{2 +
  \kappa} (\rho^{3 + \kappa}) \cap L^{\infty} (\rho)$ be a classical solution
  to
  \[ \Q \psi + \psi^3 = \Psi \]
  then
  \[ \| \psi \|_{\CC^{2 + \kappa} (\rho^{3 + \kappa})} \lesssim_{\rho,\mu}  \|
     \Psi \|_{\CC^{\kappa} (\rho^{3 + \kappa})} + \| \psi \|_{L^{\infty}
     (\rho)}^{3 + \kappa}. \]
\end{lemma}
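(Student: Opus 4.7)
The plan is to move the cubic term to the right-hand side, apply a linear weighted Schauder estimate for $\Q=-\Delta+\mu$, and then tame $\psi^3$ through a product estimate combined with interpolation and Young's inequality.

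First, writing $\Q\psi=\Psi-\psi^3$, I would invoke the standard weighted Schauder estimate: since $\mu>0$ and $\rho^{3+\kappa}$ is admissible, the resolvent $\Q^{-1}$ is bounded $\CC^{\kappa}(\rho^{3+\kappa})\to \CC^{2+\kappa}(\rho^{3+\kappa})$ (this reduces, via the equivalence $\|\cdot\|_{\CC^\alpha(\rho)}\sim\|\rho\cdot\|_{\CC^\alpha}$ from \eqref{eq:1}, to a classical unweighted estimate plus commutator bounds on $[\Q,\rho^{3+\kappa}]$ which are admissible because of the defining property $|D^\gamma\rho|\lesssim\rho$). This yields
\[
\|\psi\|_{\CC^{2+\kappa}(\rho^{3+\kappa})}\lesssim_{\mu,\rho}\|\Psi\|_{\CC^{\kappa}(\rho^{3+\kappa})}+\|\psi^{3}\|_{\CC^{\kappa}(\rho^{3+\kappa})}.
\]

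Second, I would control the cubic term by the weighted product rule. Splitting $\rho^{3+\kappa}=\rho\cdot\rho\cdot\rho^{1+\kappa}$ and applying the Bony paraproduct estimate for $\CC^{\kappa}$ with $\kappa>0$ (three factors, one in $\CC^{\kappa}$ and two in $L^{\infty}$, in appropriately weighted spaces) gives
\[
\|\psi^{3}\|_{\CC^{\kappa}(\rho^{3+\kappa})}\lesssim \|\psi\|_{\CC^{\kappa}(\rho^{1+\kappa})}\|\psi\|_{L^{\infty}(\rho)}^{2}.
\]

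Third, I would interpolate the middle factor via Lemma \ref{lemma:interp} applied with $\alpha=\kappa$, obtaining
\[
\|\psi\|_{\CC^{\kappa}(\rho^{1+\kappa})}\lesssim\|\psi\|_{L^{\infty}(\rho)}^{1-\kappa/(2+\kappa)}\|\psi\|_{\CC^{2+\kappa}(\rho^{3+\kappa})}^{\kappa/(2+\kappa)},
\]
so that
\[
\|\psi^{3}\|_{\CC^{\kappa}(\rho^{3+\kappa})}\lesssim\|\psi\|_{L^{\infty}(\rho)}^{3-\kappa/(2+\kappa)}\|\psi\|_{\CC^{2+\kappa}(\rho^{3+\kappa})}^{\kappa/(2+\kappa)}.
\]
Finally, Young's inequality with conjugate exponents $(2+\kappa)/\kappa$ and $(2+\kappa)/2$ gives, for any $\varepsilon>0$,
\[
\|\psi^{3}\|_{\CC^{\kappa}(\rho^{3+\kappa})}\leq\varepsilon\|\psi\|_{\CC^{2+\kappa}(\rho^{3+\kappa})}+C_{\varepsilon}\|\psi\|_{L^{\infty}(\rho)}^{3+\kappa},
\]
the key arithmetic being that $\bigl(3-\tfrac{\kappa}{2+\kappa}\bigr)\cdot\tfrac{2+\kappa}{2}=3+\kappa$. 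Choosing $\varepsilon$ small and absorbing into the left-hand side gives the claim. The only delicate point is the weighted Schauder estimate in the first step; everything else is a clean book-keeping with exponents designed exactly so that the $\CC^{2+\kappa}(\rho^{3+\kappa})$ norm can be absorbed and the cubic remainder produces precisely the stated $\|\psi\|_{L^\infty(\rho)}^{3+\kappa}$.
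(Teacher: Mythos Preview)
Your proposal is correct and follows essentially the same approach as the paper: move $\psi^3$ to the right, apply the weighted Schauder estimate $\|\psi\|_{\CC^{2+\kappa}(\rho^{3+\kappa})}\lesssim\|\Q\psi\|_{\CC^{\kappa}(\rho^{3+\kappa})}$, bound $\|\psi^3\|_{\CC^{\kappa}(\rho^{3+\kappa})}\lesssim\|\psi\|_{L^\infty(\rho)}^2\|\psi\|_{\CC^{\kappa}(\rho^{1+\kappa})}$, interpolate via Lemma~\ref{lemma:interp}, and absorb via Young's inequality. The only cosmetic difference is that the paper obtains the linear Schauder step by directly citing the norm equivalence $\|\Q f\|_{\CC^\alpha(\rho)}\simeq_\mu\|f\|_{\CC^{2+\alpha}(\rho)}$ from \cite[Theorem~6.5]{T06}, rather than arguing through commutators as you sketch.
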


\begin{proof}
In view of \cite[Theorem 6.5]{T06} it holds
$$
\|\Q f\|_{\CC^\alpha(\rho)}\simeq_{\mu} \|f\|_{\CC^{2+\alpha}(\rho)}
$$
in the sense of equivalence of norms. Hence
$$
\|\psi\|_{\CC^{2+\kappa}(\rho^{3+\kappa})}\lesssim \|\Q\psi\|_{\CC^\kappa(\rho^{3+\kappa})}\leqslant \|\Psi\|_{\CC^\kappa(\rho^{3+\kappa})}+\|\psi^3\|_{\CC^\kappa(\rho^{3+\kappa})}
$$
and we estimate  using Lemma \ref{lemma:interp} and weighted Young inequality 
  \[ \| \psi^3 \|_{\CC^{\kappa} (\rho^{3 + \kappa})} \lesssim \| \psi
     \|_{L^{\infty} (\rho)}^2 \| \psi \|_{\CC^{\kappa} (\rho^{1 + \kappa})}\lesssim \| \psi
     \|_{L^{\infty} (\rho)}^2 \| \psi \|_{L^{\infty} (\rho)}^{1 - \kappa / (2
     + \kappa)} \| \psi \|_{\CC^{2 + \kappa} (\rho^{3 + \kappa})}^{\kappa / (2
     + \kappa)}
    \]
    \[
    \leq
    c\left( \|\psi\|_{L^{\infty} (\rho)}^2 \| \psi \|_{L^{\infty} (\rho)}^{1 - \kappa / (2
     + \kappa)}\right)^{(2 + \kappa) / 2} + \frac{1}{2}\left( \| \psi \|_{\CC^{2 +
     \kappa} (\rho^{3 + \kappa})}^{\kappa / (2 + \kappa)} \right)^{(2 +
     \kappa) / \kappa}
     \]
     \[
     \leqslant c\| \psi
     \|_{L^{\infty} (\rho)}^{3 + \kappa} +
     \frac{1}{2} \| \psi \|_{\CC^{2 + \kappa} (\rho^{3 + \kappa})} .
    \]
Thus we finally deduce the claim.
\end{proof}

\subsection{Elliptic coercive estimates}
\label{s:ell2}

An essential result in our analysis is the following maximum principle in the weighted setting.

\begin{lemma}
  \label{lemma:apriori-elliptic}
  Fix $\kappa > 0$ and let $\psi \in \CC^{2 +
  \kappa} (\rho^{3 + \kappa}) \cap L^{\infty} (\rho)$ be a classical solution
  to
  \[ \Q \psi + \psi^3 = \Psi. \]
  Then the following a priori estimate holds
  \[ \| \psi \|_{L^{\infty} (\rho)} \lesssim_{\rho,\mu} 1 + \| \Psi \|_{L^{\infty}
     (\rho^3)}^{1 / 3}. \]
\end{lemma}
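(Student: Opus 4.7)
The plan is to establish the bound via a weighted maximum principle applied at a suitable maximizer of $g:=\rho\psi$, exploiting the coercivity provided by the cubic nonlinearity. The argument hinges on a pointwise computation at an extremum of $g$, combined with an approximation step to guarantee that this extremum is actually attained.

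First I would carry out the formal pointwise argument. Suppose $g=\rho\psi$ attains a positive supremum at some point $x_{0}\in\R^{d}$ (the case of a negative infimum is analogous). At $x_0$ the conditions $\nabla g(x_0)=0$ and $\Delta g(x_0)\leq 0$ read, after expansion,
\[
\nabla\psi(x_0)=-\psi(x_0)\frac{\nabla\rho(x_0)}{\rho(x_0)},\qquad \Delta\psi(x_0)\leq h(x_0)\psi(x_0),
\]
where $h:=-\Delta\rho/\rho+2|\nabla\rho|^{2}/\rho^{2}$ is bounded by a constant $M=M(\rho)$ because $\rho$ is admissible. Substituting into $-\Delta\psi+\mu\psi+\psi^{3}=\Psi$ at $x_0$ yields
\[
\psi(x_0)^{3}\leq \Psi(x_0)+(M-\mu)\psi(x_0).
\]
Multiplying by $\rho(x_0)^{3}>0$, writing $\rho^{3}\psi=\rho^{2}g$, and applying Young's inequality to absorb the linear term $(M-\mu)\rho(x_0)^{2}g(x_0)$ into $\tfrac12 g(x_0)^{3}$ (using $\rho\leq\|\rho\|_{L^\infty}<\infty$), I obtain $g(x_0)^{3}\lesssim 1+\|\Psi\|_{L^{\infty}(\rho^{3})}$, from which the claimed estimate follows.

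The main obstacle is the justification that the supremum is attained. Since $\rho$ decays at infinity while $\psi$ may grow like $\rho^{-1}$, the function $g=\rho\psi$ is bounded but need not decay. To handle this I would approximate $\rho$ by
\[
\rho_{\epsilon}(x):=\frac{\rho(x)}{1+\epsilon|x|^{2}},\qquad \epsilon\in(0,1].
\]
A direct computation shows that $\rho_{\epsilon}$ is admissible, and the constants in the admissibility bound for $\rho_\epsilon$ are uniform in $\epsilon$; indeed the contributions of $(1+\epsilon|x|^{2})^{-1}$ to the logarithmic derivatives vanish uniformly as $\epsilon\to 0$. Moreover, since $|\psi|\leq C\rho^{-1}$, one has $\rho_{\epsilon}|\psi|\leq C(1+\epsilon|x|^{2})^{-1}\to 0$ as $|x|\to\infty$, so $\rho_{\epsilon}\psi$ is continuous, bounded, and vanishes at infinity, hence attains its extrema.

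To finish, I apply the pointwise argument to $\rho_{\epsilon}$ in place of $\rho$. The resulting constant $M=M(\rho_{\epsilon})$ is uniform in $\epsilon$ by the previous observation, so I obtain
\[
\|\psi\|_{L^{\infty}(\rho_{\epsilon})}\lesssim 1+\|\Psi\|_{L^{\infty}(\rho_{\epsilon}^{3})}^{1/3}\leq 1+\|\Psi\|_{L^{\infty}(\rho^{3})}^{1/3},
\]
since $\rho_{\epsilon}\leq\rho$. Finally, because $\rho_{\epsilon}(x)\uparrow\rho(x)$ pointwise as $\epsilon\to 0$, passing to the limit on the left hand side via pointwise convergence (together with $\rho_{\epsilon}\psi\to\rho\psi$ pointwise and a standard supremum-limit argument) yields the required bound $\|\psi\|_{L^{\infty}(\rho)}\lesssim 1+\|\Psi\|_{L^{\infty}(\rho^{3})}^{1/3}$. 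The hypothesis $\psi\in\CC^{2+\kappa}(\rho^{3+\kappa})$ guarantees that $\psi$ is a classical $C^{2}$ function, so the pointwise calculus at $x_0$ is justified.
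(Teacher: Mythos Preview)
Your proof is correct and follows essentially the same route as the paper: a weighted maximum principle applied to $\rho\psi$, followed by an approximation of the weight to guarantee that the extremum is attained. The only difference is cosmetic: the paper replaces $\rho$ by $\rho^{1+\delta}$ and sends $\delta\to 0$, whereas you use $\rho/(1+\epsilon|x|^{2})$ and send $\epsilon\to 0$; both choices make the weighted function vanish at infinity while keeping the relevant constants uniform.
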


\begin{proof}
Let $\rho > 0$ be the weight from the statement of the Lemma and let $\bar{\psi} = \rho \psi$. Due to the assumption, $\bar{\psi}$ is bounded and locally belongs  to $\CC^{2+\kappa}$. 
  Assume for a moment that $\bar{\psi}$ has a global maximum and let $\hat{x}$
  be a global maximum point of $\bar{\psi}$. Then at $\hat{x}$ we have
  \[ 0 = \nabla \bar{\psi} = \rho \nabla \psi + \psi \nabla \rho, \qquad 0
     \leqslant - \Delta \bar{\psi} = - \rho \Delta \psi - (\Delta \rho) \psi -
     2 \nabla \rho \nabla \psi = - \rho \Delta \psi - \left[ (\Delta \rho) - 2
     \frac{| \nabla \rho |^2}{\rho} \right] \psi, \]
  so always at $\hat{x}$ we also have
  \begin{equation*}
   \psi^3 + \mu \psi \leqslant \Psi - \left[ \frac{(\Delta \rho)}{\rho} - 2
     \frac{| \nabla \rho |^2}{\rho^2} \right] \psi 
  \end{equation*}
and  multiplying by $\rho^3$ leads to
  \[ (\bar{\psi})^3 \leqslant \rho^3 \Psi - \rho^2 (\mu + [(\Delta \rho) /
     \rho - 2 (\nabla \rho / \rho)^2]) \bar{\psi}. \]
If $\bar{\psi}(\hat{ x})\geq 0$     then
  $(\bar{\psi}^{})_+^3 \leqslant \| \rho^3 \Psi
  \|_{L^{\infty}} + C_{\rho,\mu} \| \rho^2 \bar{\psi} \|_{L^{\infty}}$. A similar
  reasoning at minima gives  $(\bar{\psi}^{})_-^3 \leqslant \| \rho^3 \Psi
  \|_{L^{\infty}} + C_{\rho,\mu} \| \rho^2 \bar{\psi} \|_{L^{\infty}}$, hence
  \[ \| \psi \|_{L^{\infty} (\rho)} \leqslant \| \Psi \|_{L^{\infty}
     (\rho^3)}^{1 / 3} + C_{\rho,\mu} \| \psi \|_{L^{\infty} (\rho)}^{1 / 3} .
  \]
Using weighted Young inequality we can absorb the second term of the r.h.s. into the l.h.s. and conclude that
  \[ \| \psi \|_{L^{\infty} (\rho)} \lesssim_{\rho,\mu} 1 + \| \Psi \|_{L^{\infty}
     (\rho^3)}^{1 / 3} . \]

Next, we consider the situation when $\psi\rho$ does not attain its global maximum. Since $\psi\rho$ is smooth and  bounded on $\R^{d}$ due to the assumption, it follows that  $\psi\rho^{1+\delta}$ vanishes at infinity for every $\delta\in (0,1)$. Consequently, it has a global maximum point and the previous part of the proof applies with $\rho$ replaced by $\rho^{1+\delta}$. The conclusion then follows by sending $\delta\to 0$ since the corresponding constant $c_{\rho^{1+\delta},\mu}$ is bounded uniformly in $\delta\in (0,1).$
\end{proof}

\subsection{Parabolic Schauder estimates}

As the next step, we derive a parabolic analog of Section \ref{s:ell1}.
To this end,  we first observe that the following Schauder estimates hold true in the weighted Besov spaces. They can be proved similarly to  \cite[Lemma A.9]{GIP}, see also \cite[Section 3.2]{MW17}.

\begin{remark}\label{rem:mu}
We note that the Schauder estimates below are formulated for a positive mass $\mu>0$. However, it can be observed that for the parabolic $\Phi^{4}$ model studied in Sections \ref{sec:42}, \ref{sec:43}, \ref{s:uniq}, \ref{s:d} this does not bring any loss of generality. Indeed, we may always add a linear term with positive mass to both sides of the equation and consider the original massive term as a right hand side. This is not true for the elliptic $\Phi^{4}$ model where the positivity of the mass seems to be essential. For notational simplicity we therefore adopt the convention that $\mu>0$ throughout the paper, that is,  for both elliptic and parabolic equations.
\end{remark}

Recall that we denoted $\Q=-\Delta+\mu$ and let $\LL=\partial_{t}+\Q$. This notation will be used throughout the paper.

\begin{lemma}\label{lemma:sch}
Let $\mu>0$, $\alpha\in\R$ and let $\rho$ be a  space-time weight. Let $v$ and $w$ solve, respectively,
$$
\LL v=f,\quad v(0)=0,\quad\qquad \LL  w=0,\quad w(0)=w_{0}.
$$ 
Then  it holds uniformly over $t\geq 0$
\begin{equation}\label{eq:1vw}
\|v(t)\|_{\CC^{2+\alpha}(\rho_{t})}\lesssim \|f\|_{L^{\infty}\CC^{\alpha}(\rho)},\qquad
\|w(t)\|_{\CC^{2+\alpha}(\rho_{t})}\lesssim \|w_{0}\|_{\CC^{2+\alpha}(\rho_{0})},
\end{equation}
if $0\leqslant 2+\alpha < 2$ then
$$
\|v\|_{C^{(2+\alpha)/{2}}L^{\infty}(\rho)}\lesssim \|f\|_{L^{\infty}\CC^{\alpha}(\rho)},\qquad\|v\|_{C^{1}L^{\infty}(\rho)}\lesssim \|f\|_{C\CC^{0}(\rho)},
$$

$$
 \|w\|_{C^{(2+\alpha)/2}L^{\infty}(\rho)}\lesssim \|w_{0}\|_{\CC^{2+\alpha}(\rho_{0})}.
$$
\end{lemma}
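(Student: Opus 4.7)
The plan is to proceed via the semigroup representation and to reduce everything to Littlewood--Paley blockwise smoothing of the heat semigroup $P_t = e^{-t\Q}$, in the spirit of [GIP, Lemma A.9] and [MW17, Section 3.2], but tracked carefully through the time-dependent polynomial weight. Write
\[
v(t)=\int_0^t P_{t-s}f(s)\,\mathd s,\qquad w(t)=P_t w_0.
\]
The first technical input is the weighted blockwise bound
\[
\|P_\tau \Delta_j g\|_{L^\infty(\rho_{t+\tau})}\lesssim e^{-c(\mu+2^{2j})\tau}\|\Delta_j g\|_{L^\infty(\rho_t)},\qquad j\geq -1,\ \tau\geq 0,
\]
which follows from the standard unweighted Bernstein-type estimate for $P_\tau$ on dyadic blocks, the equivalence $\|h\|_{\CC^\alpha(\rho_t)}\sim\|\rho_t h\|_{\CC^\alpha}$ recalled in \eqref{eq:1}, and a commutator estimate controlling $[P_\tau,\rho_t]$: since $\rho$ is an admissible space-time polynomial weight, the ratio $\rho_{t+\tau}/\rho_t$ is bounded by a polynomial in $\tau$ which is absorbed by the exponential factor coming from $\mu>0$ and by extra factors of $2^{-j}$ from the Bernstein inequality.

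Once the blockwise bound is available, the spatial estimates \eqref{eq:1vw} are immediate. For $w$ we simply bound
\[
\|\Delta_j w(t)\|_{L^\infty(\rho_t)}\lesssim e^{-c(\mu+2^{2j})t}\|\Delta_j w_0\|_{L^\infty(\rho_0)}\lesssim 2^{-j(2+\alpha)}\|w_0\|_{\CC^{2+\alpha}(\rho_0)}.
\]
For $v$ we integrate the analogous inequality, yielding
\[
\|\Delta_j v(t)\|_{L^\infty(\rho_t)}\lesssim\int_0^t e^{-c(\mu+2^{2j})(t-s)}2^{-j\alpha}\|f(s)\|_{\CC^\alpha(\rho_s)}\,\mathd s\lesssim 2^{-j(2+\alpha)}\|f\|_{L^\infty\CC^\alpha(\rho)},
\]
and taking the supremum over $j$ gives the desired $\CC^{2+\alpha}(\rho_t)$ bound.

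For the time regularity with $0\leq 2+\alpha<2$, the key ingredient is the complementary inequality $\|(P_\tau-\mathrm{Id})h\|_{L^\infty(\rho_{t+\tau})}\lesssim \tau^{\beta/2}\|h\|_{\CC^\beta(\rho_t)}$ for $0\leq\beta\leq 2$, again obtained blockwise from $|1-e^{-(\mu+2^{2j})\tau}|\lesssim(2^{2j}\tau)^{\beta/2}$. Combined with the semigroup identity $v(t)=P_{t-s}v(s)+\int_s^t P_{t-r}f(r)\,\mathd r$, this yields
\[
\|v(t)-v(s)\|_{L^\infty(\rho_t)}\lesssim (t-s)^{(2+\alpha)/2}\|v(s)\|_{\CC^{2+\alpha}(\rho_s)}+(t-s)\|f\|_{L^\infty\CC^0(\rho)},
\]
and using the first part to bound $\|v(s)\|_{\CC^{2+\alpha}(\rho_s)}$ produces the $C^{(2+\alpha)/2}L^\infty(\rho)$ norm, modulo the equivalent reformulation \eqref{eq:teq} of the weighted H\"older seminorm. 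The analogous bound for $w$ is obtained by the same argument with $f\equiv 0$. Finally, the $C^1L^\infty(\rho)$ estimate is read off directly from the equation: $\partial_t v=-\Q v+f$, so
\[
\|\partial_t v(t)\|_{L^\infty(\rho_t)}\lesssim\|v(t)\|_{\CC^2(\rho_t)}+\|f(t)\|_{L^\infty(\rho_t)}\lesssim\|f\|_{C\CC^0(\rho)},
\]
where the bound on $\|v(t)\|_{\CC^2(\rho_t)}$ is exactly the $\alpha=0$ case of \eqref{eq:1vw} already proved.

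The only delicate point is the interaction between the heat semigroup and the space-time weight $\rho(t,x)$: shifting weights between times $s$ and $t$ produces polynomial-in-$(t-s)$ losses which must be reabsorbed by the exponential decay $e^{-c\mu(t-s)}$ coming from the strict positivity of $\mu$ (cf.\ Remark \ref{rem:mu}) and by the Bernstein factor $2^{-j}$ on dyadic blocks; this is the step where one genuinely uses that $\rho$ is admissible and polynomial rather than an arbitrary weight, and it is the reason why the same argument must be redone more carefully in Sections \ref{ss:b} and \ref{ss:d1}--\ref{ss:d2} for the non-admissible weights required there.
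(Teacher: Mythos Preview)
Your approach is essentially the paper's: both use the mild formulation, blockwise smoothing of $P_t$ on Littlewood--Paley pieces, and the identity $v(t)-v(s)=(P_{t-s}-\mathrm{Id})v(s)+\int_s^t P_{t-r}f(r)\,\mathd r$ for time increments. Two points deserve correction.

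First, you overcomplicate the weight issue. Throughout the paper the space-time weight $\rho$ is taken nonincreasing in time, so $\rho_t\leq\rho_s$ for $t\geq s$ and hence $\|\cdot\|_{L^\infty(\rho_t)}\leq\|\cdot\|_{L^\infty(\rho_s)}$. No commutator $[P_\tau,\rho_t]$ and no absorption of polynomial losses by the $e^{-c\mu\tau}$ factor is needed; the paper simply invokes monotonicity of $\rho$ in $t$ and moves on.

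Second, your displayed bound for the integral term in the time regularity estimate is wrong as stated: you write $(t-s)\|f\|_{L^\infty\CC^0(\rho)}$, but in the regime $0\leq 2+\alpha<2$ one has $\alpha<0$, so $f(r)$ is not in $L^\infty$ and this norm is not available. The paper instead uses the smoothing of $P_{t-r}$ to pull $f(r)\in\CC^\alpha$ up to $L^\infty$ at the cost of $(t-r)^{\alpha/2}$, which integrates to $(t-s)^{(2+\alpha)/2}\|f\|_{L^\infty\CC^\alpha(\rho)}$; this is what you should write there. Your separate derivation of the $C^1L^\infty$ bound by reading $\partial_t v=-\Q v+f$ off the equation is a clean alternative to the paper's route for that particular estimate.
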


\begin{proof}
Denote $P_{t}=e^{t(\Delta-\mu)}$ be the semigroup of operators generated by $\Delta-\mu$ and recall that  $\mu>0$. Consider a time independent weight $\rho$ and  observe that similarly to \cite[Lemma A.7, Lemma A.8]{GIP} it holds true  uniformly over $t\geq 0$
\begin{align*}
\|P_{t}g\|_{\CC^{\alpha+\delta}(\rho)}\lesssim e^{-\mu t}t^{-\delta/2}\|g\|_{\CC^{\alpha}(\rho)},\qquad \|P_{t}g\|_{\CC^{\delta}(\rho)}\lesssim e^{-\mu t} t^{-\delta/2}\|g\|_{L^{\8}(\rho)}
\end{align*}
and if $0\leq\alpha \leqslant 2$
$$
\|(P_{t}-\mathrm{Id})g\|_{L^{\8}(\rho)}\leqslant |e^{-\mu t}-1|\|e^{t\Delta}g \|_{L^{\8}(\rho)}+e^{-\mu t}\|(e^{t\Delta}-\mathrm{Id})g \|_{L^{\8}(\rho)} \lesssim t^{\alpha/2}\|g\|_{\CC^{\alpha}(\rho)}.
$$
For a space-time weight, we obtain by the same  argument 
\begin{align}\label{eq:sch10}
\|P_{t}g\|_{\CC^{\alpha+\delta}(\rho_{t})}\lesssim e^{-\mu t}t^{-\delta/2}\|g\|_{\CC^{\alpha}(\rho_{t})},\qquad \|P_{t}g\|_{\CC^{\delta}(\rho_{t})}\lesssim e^{-\mu t} t^{-\delta/2}\|g\|_{L^{\8}(\rho_{t})},
\end{align}
$$
\|(P_{t}-\mathrm{Id})g\|_{L^{\8}(\rho_{t})} \lesssim t^{\alpha/2}\|g\|_{\CC^{\alpha}(\rho_{t})}.
$$
Then, if $2^{-2k}>t$  it follows from the fact that the weight is nonincreasing in time that
\begin{align*}
\left\|\int_{0}^{t}P_{t-s}\Delta_{k}f_{s}\mathrm{d}s\right\|_{L^{\infty}(\rho_{t})}&\lesssim t 2^{-k\alpha}\|f\|_{L^{\infty}\CC^{\alpha}(\rho)}\lesssim 2^{-k(2+\alpha)}\|f\|_{L^{\infty}\CC^{\alpha}(\rho)}.
\end{align*}
If $2^{-2k}\leqslant t$ then we split the integral into two parts
\begin{align*}
\left\|\int_{t-2^{-2k}}^{t}P_{t-s}\Delta_{k}f_{s}\mathrm{d}s\right\|_{L^{\infty}(\rho_{t})}&\lesssim 2^{-k\alpha}\int_{t-2^{-2k}}^{t}e^{-\mu(t-s)}\mathrm{d}s \left\|f\right\|_{\CC^{\alpha}(\rho_{t})}\lesssim 2^{-k(2+\alpha)}\|f\|_{L^{\infty}\CC^{\alpha}(\rho)}
\end{align*}
and
\begin{align*}
&\left\|\int_{0}^{t-2^{-2k}}P_{t-s}\Delta_{k}f_{s}\mathrm{d}s\right\|_{L^{\infty}(\rho_{t})}\lesssim e^{-\mu t} \int_{0}^{t-2^{-2k}}e^{\mu s} (t-s)^{-1-\varepsilon}\dd s \, 2^{-k(\alpha+2(1+\varepsilon))}\|f\|_{L^{\infty}\CC^{\alpha}(\rho)}\\
&\qquad\lesssim t^{-\varepsilon}e^{-\mu t} \int_{0}^{1-2^{-2k}/t}e^{\mu ts} (1-s)^{-1-\varepsilon}\dd s \, 2^{-k(\alpha+2(1+\varepsilon))}\|f\|_{L^{\infty}\CC^{\alpha}(\rho)}\\
&\qquad\lesssim 2^{-k(\alpha+2)}\|f\|_{L^{\infty}\CC^{\alpha}(\rho)}= 2^{-k(\alpha+2)}\|f\|_{L^{\infty}\CC^{\alpha}(\rho)}.
\end{align*}
Note that all the above inequalities are uniform over $t\geq 0$. Hence the first bound in \eqref{eq:1vw} follows. The second one is obtained as (recall that the weight is nonincreasing in time)
$$
\|w(t)\|_{\CC^{2+\alpha}(\rho_{t})}\lesssim e^{-\mu t}\|w_{0}\|_{\CC^{2+\alpha}(\rho_{t})} \lesssim \|w_{0}\|_{\CC^{2+\alpha}(\rho_{0})}.
$$

The time regularity of $w$ follows from
\begin{align*}
\|w(t)-w(s)\|_{L^{\infty}(\rho_{t})}&=\|P_{s}(P_{t-s}-\mathrm{Id})w_{0}\|_{L^{\infty}(\rho_{t})}\lesssim |t-s|^{(2+\alpha)/2}\|w_{0}\|_{\CC^{2+\alpha}(\rho_{0})}
\end{align*}
and due to
$$
v(t)-v(s)=(P_{t-s}-\mathrm{Id})v({s})+\int_{s}^{t} P_{t-r}f({r})\dd r,\quad s<t,
$$
we obtain
\begin{align*}
\|v(t)-v(s)\|_{L^{\infty}(\rho_{t})}&\lesssim |t-s|^{(2+\alpha)/2}\|v({s})\|_{\CC^{2+\alpha}(\rho_{t})}+ |t-s|^{(2+\alpha)/2}\|f \|_{L^{\infty}\CC^{\alpha}(\rho)}\\
&\lesssim |t-s|^{(2+\alpha)/2}\|f \|_{L^{\infty}\CC^{\alpha}(\rho)}.
\end{align*}
The proof is complete.
\end{proof}

Next, we derive a Schauder estimate for  parabolic equations including a cubic nonlinearity.

\begin{lemma}
  \label{lemma:schauder-par}
Let $\mu>0$ and let $\rho$ be a space-time weight.  Fix $\kappa > 0$ and let $\psi \in C\CC^{2 +
  \kappa} (\rho^{3 + \kappa}) \cap C^{1}L^{\infty} (\rho^{3+\kappa})\cap L^{\infty}L^{\infty}(\rho)$ be a classical solution
  to
  \[\LL  \psi + \psi^3 = \Psi ,\qquad \psi(0)=\psi_{0}.\]
Then
  \[ \| \psi \|_{C\CC^{2 + \kappa} (\rho^{3 + \kappa})} +\|\psi\|_{C^{1}L^{\infty}(\rho^{3+\kappa})} \lesssim \|\psi_{0}\|_{\CC^{2+\kappa}(\rho_{0})}+  \|
     \Psi \|_{C\CC^{\kappa} (\rho^{3 + \kappa})} + \| \psi \|_{L^{\infty}L^{\infty}
     (\rho)}^{3 + \kappa}. \]
\end{lemma}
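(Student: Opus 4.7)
The proof will mirror the elliptic Schauder estimate in Lemma \ref{lemma:schauder-ellptic}, replacing the elliptic bound by the parabolic linear Schauder estimate of Lemma \ref{lemma:sch} and the static interpolation by its time-dependent version Lemma \ref{lemma:interp2}. First I would decompose $\psi = v + w$, where $v$ and $w$ solve
\[
\LL v = \Psi - \psi^3, \quad v(0) = 0, \qquad \LL w = 0, \quad w(0) = \psi_0.
\]
Lemma \ref{lemma:sch} then yields, uniformly in $t \geq 0$,
\[
\|\psi(t)\|_{\CC^{2+\kappa}(\rho_t^{3+\kappa})} \lesssim \|\Psi - \psi^3\|_{L^\infty \CC^\kappa(\rho^{3+\kappa})} + \|\psi_0\|_{\CC^{2+\kappa}(\rho_0)},
\]
together with continuity in $t$ inherited from that of $\Psi$ and $\psi^3$, so that $\psi \in C\CC^{2+\kappa}(\rho^{3+\kappa})$ is controlled by the same quantity.

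Next I would handle the cubic term exactly as in the elliptic proof. Two applications of the multiplicative estimate $\|fg\|_{\CC^\kappa(\rho_1\rho_2)} \lesssim \|f\|_{L^\infty(\rho_1)}\|g\|_{\CC^\kappa(\rho_2)}$ (valid since $\kappa>0$) give
\[
\|\psi^3\|_{C\CC^\kappa(\rho^{3+\kappa})} \lesssim \|\psi\|_{L^\infty L^\infty(\rho)}^2 \,\|\psi\|_{C\CC^\kappa(\rho^{1+\kappa})},
\]
and Lemma \ref{lemma:interp2} with $\alpha = \kappa$ produces
\[
\|\psi\|_{C\CC^\kappa(\rho^{1+\kappa})} \lesssim \|\psi\|_{L^\infty L^\infty(\rho)}^{1 - \kappa/(2+\kappa)} \,\|\psi\|_{C\CC^{2+\kappa}(\rho^{3+\kappa})}^{\kappa/(2+\kappa)}.
\]
Applying the weighted Young inequality with conjugate exponents $(2+\kappa)/2$ and $(2+\kappa)/\kappa$ turns this into
\[
\|\psi^3\|_{C\CC^\kappa(\rho^{3+\kappa})} \leq C\,\|\psi\|_{L^\infty L^\infty(\rho)}^{3+\kappa} + \tfrac{1}{2}\,\|\psi\|_{C\CC^{2+\kappa}(\rho^{3+\kappa})}.
\]
Substituting into the bound for $\psi$ and absorbing the last term into the left-hand side yields the desired control on $\|\psi\|_{C\CC^{2+\kappa}(\rho^{3+\kappa})}$.

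For the second term $\|\psi\|_{C^1 L^\infty(\rho^{3+\kappa})}$, I would read the time derivative directly from the equation: $\partial_t \psi = \Delta \psi - \mu \psi + \Psi - \psi^3$. The bound just proven controls $\|\Delta \psi\|_{CL^\infty(\rho^{3+\kappa})}$ and $\|\mu \psi\|_{CL^\infty(\rho^{3+\kappa})}$, the cubic estimate above controls $\|\psi^3\|_{CL^\infty(\rho^{3+\kappa})}$, and $\Psi \in C\CC^\kappa(\rho^{3+\kappa}) \hookrightarrow CL^\infty(\rho^{3+\kappa})$ is given. Hence $\partial_t\psi$ is bounded in $L^\infty L^\infty(\rho^{3+\kappa})$ by the claimed right-hand side, and the Lipschitz-in-time bound follows by integrating in $t$, using that $\rho$ is nonincreasing in time.

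The only delicate point is the arithmetic of the weights and exponents in the interpolation plus Young step: one must verify that the exponents $(2+\kappa)/2$ and $(2+\kappa)/\kappa$ sum to one as conjugates, and that $(3 - \kappa/(2+\kappa))\cdot(2+\kappa)/2 = 3+\kappa$, so that Young's inequality produces exactly the advertised $\|\psi\|_{L^\infty L^\infty(\rho)}^{3+\kappa}$ while leaving a coefficient $1/2$ in front of $\|\psi\|_{C\CC^{2+\kappa}(\rho^{3+\kappa})}$ available for absorption. Everything else is a bookkeeping exercise with the parabolic linear estimates.
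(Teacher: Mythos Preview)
Your proof is correct and follows essentially the same route as the paper: apply the linear parabolic Schauder estimates of Lemma~\ref{lemma:sch}, bound the cubic term via the product estimate and the interpolation Lemma~\ref{lemma:interp2}, then use Young's inequality with exponents $(2+\kappa)/2$ and $(2+\kappa)/\kappa$ to absorb. The only cosmetic difference is that the paper invokes Lemma~\ref{lemma:sch} once to get both the $C\CC^{2+\kappa}$ and $C^{1}L^{\infty}$ bounds simultaneously, whereas you obtain the $C^{1}L^{\infty}$ bound separately by reading $\partial_t\psi$ from the equation; both are equivalent.
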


\begin{proof}
Due to Lemma \ref{lemma:sch} it holds
$$
\|\psi\|_{C\CC^{2+\kappa}(\rho^{3+\kappa})}+\|\psi\|_{C^{1}L^{\infty}(\rho^{3+\kappa})}\lesssim\|\psi_{0}\|_{\CC^{2+\kappa}(\rho_{0})}+  \|\Psi\|_{C\CC^\kappa(\rho^{3+\kappa})}+\|\psi^3\|_{C\CC^\kappa(\rho^{3+\kappa})}
$$
 and we estimate  pointwise in time using Lemma \ref{lemma:interp2} and the weighted Young inequality
  \[ \| \psi^3 \|_{C\CC^{\kappa} (\rho^{3 + \kappa})} \lesssim \| \psi
     \|_{L^{\infty}L^{\infty} (\rho)}^2 \| \psi \|_{C\CC^{\kappa} (\rho^{1 + \kappa})}\lesssim \| \psi
     \|_{L^{\infty}L^{\infty} (\rho)}^2 \| \psi \|_{L^{\infty}L^{\infty} (\rho)}^{1 - \kappa / (2
     + \kappa)} \| \psi \|_{C\CC^{2 + \kappa} (\rho^{3 + \kappa})}^{\kappa / (2
     + \kappa)}
    \]
    $$
    \leq
    c\left( \|\psi\|_{L^{\infty}L^{\infty} (\rho)}^2 \| \psi \|_{L^{\infty}L^{\infty} (\rho)}^{1 - \kappa / (2
     + \kappa)}\right)^{(2 + \kappa) / 2} + \frac{1}{2}\left( \| \psi \|_{C\CC^{2 +
     \kappa} (\rho^{3 + \kappa})}^{\kappa / (2 + \kappa)} \right)^{(2 +
     \kappa) / \kappa}
    $$
    $$ \leqslant c\| \psi
     \|_{L^{\infty} L^{\infty}(\rho)}^{3 + \kappa} +
     \frac{1}{2} \| \psi \|_{C\CC^{2 + \kappa} (\rho^{3 + \kappa})} .
    $$
    Hence
    $$
     \| \psi^3 \|_{C\CC^{\kappa} (\rho^{3 + \kappa})} \leqslant c\| \psi
     \|_{L^{\infty}L^{\infty} (\rho)}^{3 + \kappa} +
     \frac{1}{2} \| \psi \|_{C\CC^{2 + \kappa} (\rho^{3 + \kappa})} 
    $$
and the claim follows.
\end{proof}

\subsection{Parabolic coercive estimates}

Similarly to Section \ref{s:ell2} we obtain the following maximum principle for parabolic equations.

\begin{lemma}
  \label{lemma:apriori-parabolic}
 Let $\mu\in\R$ and let $\rho$ be a  space-time weight. Fix $\kappa > 0$ and let $\psi \in C\CC^{2+\kappa}(\rho^{3+\kappa})\cap C^{1}L^{\infty}(\rho^{3+\kappa})\cap L^{\infty}L^{\infty}(\rho)$ be a classical
  solution to
  \[\LL  \psi + \psi^3 = \Psi,\qquad \psi(0)=\psi_{0} .\]
 Then the following a priori estimate holds
  \[ \| \psi  \|_{L^{\infty}L^{\infty} (\rho)} \lesssim 1+ \|\psi_{0}\|_{L^{\infty}(\rho_{0})}+ \|\Psi\|^{1/3}_{L^{\infty}L^{\infty}(\rho^{3})},
  \]
  where $\rho_{0}=\rho(0,\cdot)$.
\end{lemma}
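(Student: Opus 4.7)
The strategy mirrors Lemma~\ref{lemma:apriori-elliptic}: set $\bar\psi:=\rho\psi$ and apply a space-time maximum principle to the weighted unknown. The main new ingredient compared with the elliptic case is the time derivative, which after translating to $\bar\psi$ produces a term of the form $\psi\,\partial_t\rho$ that admissibility of $\rho$ will control in exactly the same way as $\nabla\rho$ and $\Delta\rho$. Since the domain $[0,\infty)\times\R^d$ is unbounded, the supremum of $\bar\psi$ may not be attained; to remedy this I fix $T>0$ and $\delta\in(0,1)$ and replace $\rho$ by $\tilde\rho_\delta:=\rho^{1+\delta}$. Because $\rho$ decays at spatial infinity and $\psi\in L^\infty L^\infty(\rho)$, the function $\bar\psi_\delta:=\tilde\rho_\delta\psi$ is continuous on $[0,T]\times\R^d$ and vanishes as $|x|\to\infty$ uniformly in $t$, so a global maximum is attained at some point $(\hat t,\hat x)\in[0,T]\times\R^d$.

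If $\hat t=0$ then directly $\bar\psi_\delta(0,\hat x)\leq\|\psi_0\|_{L^\infty(\rho_0)}$. Otherwise $\hat t\in(0,T]$ and at $(\hat t,\hat x)$ one has $\partial_t\bar\psi_\delta\geq 0$, $\nabla\bar\psi_\delta=0$, $-\Delta\bar\psi_\delta\geq 0$ (the time derivative being a left-sided one when $\hat t=T$). Expanding via the Leibniz rule in $\bar\psi_\delta=\tilde\rho_\delta\psi$, substituting the identity $\nabla\psi=-\psi\nabla\tilde\rho_\delta/\tilde\rho_\delta$ forced by the vanishing gradient, and feeding in $\partial_t\psi-\Delta\psi+\mu\psi+\psi^3=\Psi$ multiplied by $\tilde\rho_\delta$, I obtain the pointwise inequality
\[
\tilde\rho_\delta\psi^3\leq \tilde\rho_\delta\Psi-\mu\bar\psi_\delta+\psi\,\partial_t\tilde\rho_\delta+2\psi|\nabla\tilde\rho_\delta|^2/\tilde\rho_\delta-\psi\,\Delta\tilde\rho_\delta
\]
at $(\hat t,\hat x)$. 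Multiplying by $\tilde\rho_\delta^2$ and using admissibility, namely $|\partial_t\tilde\rho_\delta|+|\nabla\tilde\rho_\delta|+|\Delta\tilde\rho_\delta|\lesssim \tilde\rho_\delta$ with a constant uniform in $\delta\in(0,1)$, leads to
\[
\bar\psi_\delta(\hat t,\hat x)^3\leq \tilde\rho_\delta^3|\Psi|(\hat t,\hat x)+C_{\rho,\mu}\,\bar\psi_\delta(\hat t,\hat x).
\]

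Repeating the argument at a global minimum of $\bar\psi_\delta$ and combining with the maximum bound and the initial datum contribution yields
\[
\|\bar\psi_\delta\|_{L^\infty_T L^\infty}^3\leq \|\psi_0\|_{L^\infty(\rho_0)}^3+\|\rho^3\Psi\|_{L^\infty L^\infty}+C_{\rho,\mu}\|\bar\psi_\delta\|_{L^\infty_T L^\infty},
\]
where I have used $\tilde\rho_\delta\leq \rho$ after normalizing $\rho\leq 1$. The weighted Young inequality $C_{\rho,\mu}a\leq \tfrac{1}{2}a^3+C'$ absorbs the linear term, giving a bound on $\|\bar\psi_\delta\|_{L^\infty_T L^\infty}$ independent of $T$ and $\delta$. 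Sending $\delta\downarrow 0$ by monotone convergence and then $T\to\infty$ yields the claimed estimate $\|\psi\|_{L^\infty L^\infty(\rho)}\lesssim 1+\|\psi_0\|_{L^\infty(\rho_0)}+\|\Psi\|^{1/3}_{L^\infty L^\infty(\rho^3)}$. The main technical obstacle is exactly the existence of a space-time extremum on the unbounded domain, which the $\rho^{1+\delta}$ trick combined with time truncation resolves; everything else reduces to the pointwise computation already familiar from the elliptic case, with admissibility of $\rho$ making the new time-derivative contribution harmless.
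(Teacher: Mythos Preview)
Your proof is correct and follows essentially the same maximum-principle strategy as the paper. The only notable differences are cosmetic: the paper exploits the monotonicity $\partial_t\rho\le 0$ to drop the term $\psi\,\partial_t\tilde\rho_\delta$ outright at a positive maximum (since then $\bar\psi_\delta>0$), whereas you bound it via $|\partial_t\tilde\rho_\delta|\lesssim\tilde\rho_\delta$ and absorb it into $C_{\rho,\mu}\bar\psi_\delta$; and the paper first argues assuming a global maximum on $[0,\infty)\times\R^d$ exists and only afterwards invokes the $\rho^{1+\delta}$ trick, while you build in the $\rho^{1+\delta}$ weighting and the time truncation $[0,T]$ from the outset. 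Both routes lead to the same cubic inequality and close via Young's inequality in the same way.
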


\begin{proof}
Let $\bar\psi=\psi\rho$ and assume for the moment that $\bar\psi$ attains its (global) maximum $M=\bar\psi(t^{*},x^{*})$ at the point $(t^{*},x^{*})$. If $M\leq0$, then it is necessary to investigate the minimum point (or alternatively the maximum of $-\bar\psi$), which we discuss below. Let us therefore assume  that $M>0$. If $t_{*}=0$ then
$$
\bar\psi\leqslant \|\psi_{0}\|_{L^{\infty}(\rho_{0})}
$$
Assume that $t^{*}>0$.
Then
$$
\rho^{2}\partial_{t}\bar\psi+\rho^{2}(-\Delta+\mu)\bar\psi +\bar\psi^{3} = \rho^{3}\Psi +\rho\partial_{t}\rho\bar\psi - \rho^{2}(\Delta \rho) \psi - 2\rho^{2}\nabla\rho\nabla\psi.
$$
and
$$
\partial_{t}\bar\psi(t^{*},x^{*})= 0,\qquad\nabla\bar\psi(t^{*},x^{*})=0,\qquad\Delta\bar\psi(t^{*},x^{*})\leq0
$$
hence $\rho\nabla\psi=-\psi\nabla\rho$. Consequently $-\rho^{2}\Delta\bar\psi(t^{*},x^{*})\geq0$ and also $\rho\partial_{t}\rho\bar\psi(t^{*},x^{*})\leqslant 0$ since $\partial_{t}\rho
\leqslant 0$. Hence
$$
M^{3}\leqslant \left[\rho^{3}\Psi -\mu\rho^{2}\bar\psi - \rho^{2}(\Delta \rho) \psi - 2\rho^{2}\nabla\rho\nabla\psi\right]_{|_{(t^{*},x^{*})}}
$$
$$
\leqslant \|\Psi\|_{L^{\infty}L^{\infty}(\rho^{3})} +\rho^{2}(t^{*},x^{*})\left[|\mu|+\left\|\frac{\nabla\rho}{\rho}\right\|^{2}_{L^{\infty}}+\left\|\frac{\Delta\rho}{\rho}\right\|_{L^{\infty}}\right]\|\bar\psi\|_{L^{\infty}L^{\infty}}
$$
$$
\leqslant \|\Psi\|_{L^{\infty}L^{\infty}(\rho^{3})} +c_{\rho,\mu}\|\bar\psi\|_{L^{\infty}L^{\infty}}.
$$
Therefore we deduce that
$$
\bar\psi\lesssim \|\psi_{0}\|_{L^{\infty}(\rho_{0})}+\|\Psi\|^{1/3}_{L^{\infty}L^{\infty}(\rho^{3})} +c_{\rho,\mu}\|\bar\psi\|^{1/3}_{L^{\infty}L^{\infty}}.
$$
The same argument applied to $-\bar\psi$ yields
$$
-\bar\psi\lesssim \|\psi_{0}\|_{L^{\infty}(\rho_{0})}+\|\Psi\|^{1/3}_{L^{\infty}L^{\infty}(\rho^{3})} +c_{\rho,\mu}\|\bar\psi\|^{1/3}_{L^{\infty}L^{\infty}}
$$
hence, taking supremum over $(t,x)\in [0,\8)\times\R^{d}$ and applying the weighted Young inequality yields the claim.

Next, we consider the situation when $\psi\rho$ does not attain its global maximum. Since $\psi\rho$ is smooth and  bounded on $[0,\infty)\times\R^{d}$ due to the assumption, it follows that  $\psi\rho^{1+\delta}$ vanishes at infinity for every $\delta\in (0,1)$. Consequently, it has a global maximum point and the previous part of the proof applies with $\rho$ replaced by $\rho^{\delta}$. The conclusion then follows by sending $\delta\to 0$ since the corresponding constant $c_{\rho^{1+\delta},\mu}$ is bounded uniformly in $\delta\in (0,1).$
\end{proof}

\subsection{Paracontrolled calculus}
\label{ssec:para}

The foundations of paracontrolled calculus were laid down in the seminal work \cite{GIP} of Gubinelli, Imkeller and Perkowski, to which we  refer the reader for a number of facts used here. We refer to the book \cite{BCD} of Bahouri, Chemin and Danchin for a gentle introduction to the use of paradifferential calculus in the study of nonlinear PDEs. We shall then freely use the decomposition  $fg=f\prec g+f\circ g+f\succ g$, where $f\prec g=g\succ f$ and $f\circ g$, respectively, stands for the paraproduct of $f$ by $g$ and the corresponding resonant term, defined in terms of Littlewood--Paley decomposition.

The following basic results are obtained similarly to the unweighted setting.

\begin{lemma}\label{lem:bcd}
Let $\rho$ be an admissible weight.
\begin{enumerate}
\item Let $\mathcal{A}$ be an annulus, let $\alpha\in\R$ and let $(u_j)_{j\geq-1}$ be a sequence of smooth functions such that $\mathcal{F}u_j$ is supported in $2^j\mathcal{A}$ and  $\|u_j\|_{L^\infty(\rho)}\lesssim 2^{-j\alpha}$ for all $j\geq-1$. Then
$$
u=\sum_{j\geq-1}u_j\in \CC^\alpha(\rho)\qquad\text{and}\qquad\|u\|_{\CC^\alpha(\rho)}\lesssim \sup_{j\geq -1}\{2^{j\alpha}\|u_j\|_{L^\infty(\rho)}\}.
$$
\item Let $\mathcal{B}$ be a ball, let $\alpha>0$ and let $(u_j)_{j\geq-1}$ be a sequence of smooth functions such that $\mathcal{F}u_j$ is supported in $2^j\mathcal{B}$ and  $\|u_j\|_{L^\infty(\rho)}\lesssim 2^{-j\alpha}$ for all $j\geq-1$. Then
$$
u=\sum_{j\geq-1}u_j\in \CC^\alpha(\rho)\qquad\text{and}\qquad\|u\|_{\CC^\alpha(\rho)}\lesssim \sup_{j\geq -1}\{2^{j\alpha}\|u_j\|_{L^\infty(\rho)}\}.
$$
\end{enumerate}
\end{lemma}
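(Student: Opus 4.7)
The statement is the weighted analogue of the standard Bernstein-type summation principle (see e.g.\ Lemma 2.84 in Bahouri--Chemin--Danchin). The plan is to mimic the classical proof while checking that the admissibility of $\rho$ is sufficient to allow convolution against the Littlewood--Paley kernels inside a weighted $L^\infty$ norm. The single non-trivial analytic fact I will use is the weighted convolution estimate
\[
\|\Delta_i f\|_{L^\infty(\rho)}\lesssim \|f\|_{L^\infty(\rho)},\qquad i\geq -1,
\]
with constant independent of $i$. This follows from admissibility of $\rho$ (item~2 in the definition): writing $\Delta_i f = K_i\ast f$ with $K_i(x)=2^{id}K(2^i x)$ and $K$ Schwartz, the bound $\rho(x)\leq c\,\rho(y)(1+|x-y|^2)^{b/2}$ absorbs the weight into the rapid decay of $K_i$, and the resulting $L^1$ norm of $(1+|\cdot|^2)^{b/2}K_i$ is bounded uniformly in $i$.

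\textbf{Part 1 (annulus support).} For fixed $i\geq-1$, the spectral support hypothesis implies $\Delta_i u_j = 0$ unless $|i-j|\leq N$ for some fixed $N$ depending only on $\mathcal{A}$ and the partition of unity defining $(\Delta_k)_{k\geq-1}$. Applying the weighted convolution estimate to each remaining term,
\[
\|\Delta_i u\|_{L^\infty(\rho)} \leq \sum_{|j-i|\leq N}\|\Delta_i u_j\|_{L^\infty(\rho)} \lesssim \sum_{|j-i|\leq N}\|u_j\|_{L^\infty(\rho)}\lesssim 2^{-i\alpha}\sup_{k\geq -1}2^{k\alpha}\|u_k\|_{L^\infty(\rho)},
\]
which yields the claim after multiplying by $2^{i\alpha}$ and taking the supremum in $i$. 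No positivity of $\alpha$ is needed here because the sum is finite.

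\textbf{Part 2 (ball support).} Now for each $i\geq -1$ the spectral support of $u_j$ is contained in a ball of radius $\simeq 2^j$, so $\Delta_i u_j$ can be non-zero for every $j\geq i-N'$ (with $N'$ again a fixed integer depending on the partition of unity and $\mathcal{B}$). The same weighted convolution estimate gives
\[
\|\Delta_i u\|_{L^\infty(\rho)} \lesssim \sum_{j\geq i-N'} \|u_j\|_{L^\infty(\rho)}\lesssim \Bigl(\sup_{k\geq -1}2^{k\alpha}\|u_k\|_{L^\infty(\rho)}\Bigr)\sum_{j\geq i-N'}2^{-j\alpha},
\]
and the geometric series converges precisely because $\alpha>0$, yielding a bound $\lesssim 2^{-i\alpha}$; this is where the positivity hypothesis on $\alpha$ enters. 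Multiplying by $2^{i\alpha}$ and taking the supremum over $i$ concludes the proof.

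\textbf{Expected obstacle.} The only mildly delicate point is the weighted convolution bound, because multiplication by $\rho$ does not commute with a Fourier multiplier; this is handled by exploiting admissibility to move the weight onto the shifted argument and pay a polynomial factor, which the Schwartz-class kernel of $\Delta_i$ absorbs. Beyond that, the argument is a verbatim adaptation of the unweighted case, so I would present the two cases together and keep the write-up short.
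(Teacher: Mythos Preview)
Your proposal is correct and is exactly the approach intended by the paper, which simply states that the proof follows the lines of \cite[Lemma~A.3]{GIP}; your write-up is a faithful weighted adaptation of that argument, with the admissibility of $\rho$ supplying the uniform bound $\|\Delta_i f\|_{L^\infty(\rho)}\lesssim\|f\|_{L^\infty(\rho)}$ needed to make the classical annulus/ball summation go through.
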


\begin{proof}
The proof follows the lines of \cite[Lemma A.3]{GIP}.
\end{proof}

\begin{lemma}[Paraproduct estimates]\label{lem:para}
Let $\rho_{1},\rho_{2}$ be admissible weights  and $\beta\in\R$. Then  it holds
\begin{equation*}
\|f\prec g\|_{\CC^\beta(\rho_{1}\rho_{2})}\lesssim_\beta\|f\|_{L^\infty(\rho_{1})}\|g\|_{\CC^{\beta}(\rho_{2})},
\end{equation*}
and if $\alpha<0$ then
\begin{equation*}
\|f\prec g\|_{\CC^{\alpha+\beta}(\rho_{1}\rho_{2})}\lesssim_{\alpha,\beta}\|f\|_{\CC^{\alpha}(\rho_{1})}\|g\|_{\CC^{\beta}(\rho_{2})}.
\end{equation*}
If  $\alpha+\beta>0$ then it holds
\begin{equation*}
\|f\circ g\|_{\CC^{\alpha+\beta}(\rho_{1}\rho_{2})}\lesssim_{\alpha,\beta}\|f\|_{\CC^{\alpha}(\rho_{1})}\|g\|_{\CC^{\beta}(\rho_{2})}.
\end{equation*}
\end{lemma}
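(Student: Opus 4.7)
The plan is to adapt the classical Bony decomposition proof to the weighted setting, using the weighted version of the summation criteria given in Lemma \ref{lem:bcd}. The structural argument is identical to the unweighted case, the only novelty being to check that the frequency-support information survives the multiplication by weights, and that pointwise products can be split across the two weights via $\|hk\|_{L^\infty(\rho_1\rho_2)}\leq \|h\|_{L^\infty(\rho_1)}\|k\|_{L^\infty(\rho_2)}$. I would also observe that since $\rho_1,\rho_2$ are admissible, so is $\rho_1\rho_2$, so the weighted Besov space $\CC^{\cdot}(\rho_1\rho_2)$ is well-defined.

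For the paraproduct estimates, write $f\prec g=\sum_{j}S_{j-1}f\,\Delta_j g$. Each summand is spectrally localized in an annulus of size $2^j$, so by the first item of Lemma \ref{lem:bcd} it suffices to bound $\|S_{j-1}f\,\Delta_j g\|_{L^\infty(\rho_1\rho_2)}$. Splitting the weight gives
\[
\|S_{j-1}f\,\Delta_j g\|_{L^\infty(\rho_1\rho_2)}\leq \|S_{j-1}f\|_{L^\infty(\rho_1)}\,\|\Delta_j g\|_{L^\infty(\rho_2)}.
\]
For the first estimate, use $\|S_{j-1}f\|_{L^\infty(\rho_1)}\lesssim \|f\|_{L^\infty(\rho_1)}$ (uniform boundedness of the low-frequency projector on $L^\infty(\rho_1)$, which follows from \eqref{eq:1} together with the analogous unweighted fact) and $\|\Delta_jg\|_{L^\infty(\rho_2)}\lesssim 2^{-j\beta}\|g\|_{\CC^\beta(\rho_2)}$. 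For the second estimate, when $\alpha<0$, sum the block bounds to obtain $\|S_{j-1}f\|_{L^\infty(\rho_1)}\lesssim \sum_{k\leq j-1}2^{-k\alpha}\|f\|_{\CC^\alpha(\rho_1)}\lesssim 2^{-j\alpha}\|f\|_{\CC^\alpha(\rho_1)}$, and combine as before.

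For the resonant term, decompose $f\circ g=\sum_{|i-j|\leq 1}\Delta_i f\,\Delta_j g$. Each summand is spectrally localized in a ball of size $\simeq 2^j$, so by the second item of Lemma \ref{lem:bcd} (which requires positive regularity) it suffices to bound the $L^\infty(\rho_1\rho_2)$ norm of each summand by a summable prefactor $2^{-j(\alpha+\beta)}$. Splitting the weight gives
\[
\|\Delta_i f\,\Delta_j g\|_{L^\infty(\rho_1\rho_2)}\leq \|\Delta_if\|_{L^\infty(\rho_1)}\|\Delta_jg\|_{L^\infty(\rho_2)}\lesssim 2^{-i\alpha}2^{-j\beta}\|f\|_{\CC^\alpha(\rho_1)}\|g\|_{\CC^\beta(\rho_2)},
\]
and since $|i-j|\leq 1$ this is $\lesssim 2^{-j(\alpha+\beta)}\|f\|_{\CC^\alpha(\rho_1)}\|g\|_{\CC^\beta(\rho_2)}$, as required by Lemma \ref{lem:bcd}(2) under the condition $\alpha+\beta>0$.

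The only subtle point, and arguably the one that needs the most care, is the uniform $L^\infty(\rho_1)$-boundedness of the partial-sum operators $S_{j-1}$ (and of $\Delta_j$) on admissible weighted spaces. I would verify this by convolving: $S_{j-1}f=\chi_{j-1}*f$ with $\|\chi_{j-1}\|_{L^1}$ uniformly bounded, and by using the admissibility bound $\rho_1(x)\lesssim \rho_1(y)(1+|x-y|^2)^{b/2}$ to absorb the polynomial growth of the shift against the rapid decay of the (rescaled) kernel. Everything else is formally identical to the unweighted Bony calculus and does not require any new idea.
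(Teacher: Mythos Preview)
Your proof is correct and follows exactly the approach indicated in the paper: adapt the standard Bony paraproduct argument from \cite[Lemma~2.1]{GIP} by replacing the unweighted summation criterion \cite[Lemma~A.3]{GIP} with its weighted analogue, Lemma~\ref{lem:bcd}. Your explicit discussion of the weight-splitting $\|hk\|_{L^\infty(\rho_1\rho_2)}\le\|h\|_{L^\infty(\rho_1)}\|k\|_{L^\infty(\rho_2)}$ and the uniform $L^\infty(\rho_1)$-boundedness of $S_{j-1}$ via admissibility of the weight is precisely the content that the paper leaves implicit.
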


\begin{proof}
The proof follows the lines of \cite[Lemma 2.1]{GIP} and uses  Lemma \ref{lem:bcd} instead of \cite[Lemma A.3]{GIP}
\end{proof}

We also obtain the following weighted analog of \cite[Lemma 2.2, Lemma 2.3]{GIP}, which is proved analogously.

\begin{lemma}\label{lem:2.2}
Let $\rho_{1},\rho_{2}$ be admissible weights   and $\alpha\in(0,1)$ and $\beta\in\R$. For all $j\geq -1$ it holds
$$
\|\Delta_j(fg)-f\Delta_jg\|_{L^\infty(\rho_{1}\rho_{2})}\lesssim 2^{-j\alpha}\|f\|_{\CC^{\alpha}(\rho_{1})}\|g\|_{L^\infty(\rho_{2})}.
$$
$$
\|\Delta_j(f\prec g)-f\Delta_j g\|_{L^\infty(\rho_{1}\rho_{2})}\lesssim 2^{-j(\alpha+\beta)}\|f\|_{\CC^\alpha(\rho_{1})}\|g\|_{\CC^\beta(\rho_{2})}.
$$
\end{lemma}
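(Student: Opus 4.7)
The plan is to follow the template of the unweighted commutator estimates in the spirit of \cite[Lemma 2.2, Lemma 2.3]{GIP}, but to carry along the weights $\rho_{1},\rho_{2}$ by exploiting the admissibility property. The main point is that admissibility allows one to translate a weight by an arbitrary shift at the cost of a factor $(1+|y|)^{b}$, and any such polynomial in $|y|$ is absorbed by the Schwartz decay of the kernel of $\Delta_{j}$. I expect this bookkeeping to be the only real work; there is no new structural obstacle compared with the unweighted case.

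For the first bound I would use the kernel representation $\Delta_{j}g=K_{j}\ast g$ with $K_{j}(y)=2^{jd}K(2^{j}y)$, $K$ Schwartz of zero integral for $j\geq 0$ (and $K_{-1}$ Schwartz), and write
$$
\Delta_{j}(fg)(x)-f(x)\Delta_{j}g(x)=\int K_{j}(y)\bigl[f(x-y)-f(x)\bigr]g(x-y)\,\mathd y.
$$
Multiplying pointwise by $\rho_{1}(x)\rho_{2}(x)$, admissibility of $\rho_{2}$ yields $\rho_{2}(x)|g(x-y)|\lesssim (1+|y|)^{b_{2}}\|g\|_{L^{\infty}(\rho_{2})}$; using the equivalent $\CC^{\alpha}(\rho_{1})$-norm \eqref{eq:2h} with $M=1$ together with admissibility of $\rho_{1}$, one obtains
$$
\rho_{1}(x)|f(x-y)-f(x)|\lesssim (|y|^{\alpha}\wedge 1)(1+|y|)^{b_{1}}\|f\|_{\CC^{\alpha}(\rho_{1})},
$$
the small-$y$ regime coming directly from \eqref{eq:2h} and the large-$y$ regime from a triangle inequality and admissibility. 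Inserting these two estimates and changing variables $y=2^{-j}z$ for $j\geq 0$ produces a factor $2^{-j\alpha}\int|K(z)||z|^{\alpha}(1+|z|)^{b_{1}+b_{2}}\mathd z$, which is finite by Schwartz decay; the case $j=-1$ is immediate from the definition of $\Delta_{-1}$.

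For the second bound, I would expand $f\prec g=\sum_{i}S_{i-1}f\,\Delta_{i}g$ and use that, by spectral localization, both $\Delta_{j}(S_{i-1}f\,\Delta_{i}g)$ and $\Delta_{j}\Delta_{i}g$ vanish unless $|i-j|\leqslant N$ for some fixed $N$. This gives the splitting
$$
\Delta_{j}(f\prec g)-f\Delta_{j}g=\sum_{|i-j|\leqslant N}\bigl[\Delta_{j}(S_{i-1}f\,\Delta_{i}g)-S_{i-1}f\,\Delta_{j}\Delta_{i}g\bigr]+\sum_{|i-j|\leqslant N}(S_{i-1}f-f)\,\Delta_{j}\Delta_{i}g.
$$
To the bracketed term I apply the first estimate of the lemma with $(f,g)\leftarrow(S_{i-1}f,\Delta_{i}g)$, using the trivial bounds $\|S_{i-1}f\|_{\CC^{\alpha}(\rho_{1})}\lesssim\|f\|_{\CC^{\alpha}(\rho_{1})}$ and $\|\Delta_{i}g\|_{L^{\infty}(\rho_{2})}\lesssim 2^{-i\beta}\|g\|_{\CC^{\beta}(\rho_{2})}$; summing $N+1$ comparable terms produces $2^{-j(\alpha+\beta)}\|f\|_{\CC^{\alpha}(\rho_{1})}\|g\|_{\CC^{\beta}(\rho_{2})}$. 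For the second sum, since $\alpha>0$,
$$
\|S_{i-1}f-f\|_{L^{\infty}(\rho_{1})}\lesssim\sum_{k\geqslant i-1}\|\Delta_{k}f\|_{L^{\infty}(\rho_{1})}\lesssim 2^{-i\alpha}\|f\|_{\CC^{\alpha}(\rho_{1})},
$$
while $\|\Delta_{j}\Delta_{i}g\|_{L^{\infty}(\rho_{2})}\lesssim 2^{-j\beta}\|g\|_{\CC^{\beta}(\rho_{2})}$, and $2^{-i}\sim 2^{-j}$ on the sum, so this piece is of the desired order as well. Combining the two contributions concludes the proof.
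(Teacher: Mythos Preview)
Your proposal is correct and is precisely the approach the paper has in mind: the paper does not give any details, stating only that the result ``is proved analogously'' to \cite[Lemma~2.2, Lemma~2.3]{GIP}, and your argument is exactly that---the standard kernel/spectral-localization proof from GIP with the admissibility bound $\rho(x)\lesssim (1+|y|)^{b}\rho(x-y)$ inserted to transport the weights and the resulting polynomial absorbed by the Schwartz decay of $K_{j}$.
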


With this in hand, we derive a weighted commutator estimate.

\begin{lemma}[Commutator lemma]\label{lem:com}
Let $\rho_{1}, \rho_{2}, \rho_{3}$ be admissible weights and let $\alpha\in (0,1)$ and $\beta,\gamma\in \R$ such that $\alpha+\beta+\gamma>0$ and $\beta+\gamma<0$. Then there exist a trilinear bounded operator $\mathrm{com}$ satisfying
$$
\|\mathrm{com}(f,g,h)\|_{\CC^{\alpha+\beta+\gamma}(\rho_{1}\rho_{2}\rho_{3})}\lesssim \|f\|_{\CC^\alpha(\rho_{1})}\|g\|_{\CC^\beta(\rho_{2})}\|h\|_{\CC^\gamma(\rho_{3})}
$$
and for smooth functions $f,g,h$
$$
\mathrm{com}(f,g,h)=(f\prec g)\circ h - f(g\circ h).
$$
\end{lemma}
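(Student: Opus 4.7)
The approach is to adapt the proof of the classical (unweighted) commutator lemma of Gubinelli, Imkeller and Perkowski to the weighted setting using the weighted paraproduct and commutator estimates already at hand, namely Lemmas \ref{lem:bcd}, \ref{lem:para} and \ref{lem:2.2}. The plan is to first define $\mathrm{com}$ on a dense subclass of smooth triples via an explicit series, establish the claimed bound, and then extend by continuity.

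For smooth $f,g,h$ one expands both resonant terms with the Littlewood--Paley decomposition to obtain
\[
\mathrm{com}(f,g,h)=(f\prec g)\circ h - f(g\circ h)=\sum_{|i-j|\leq 1}\bigl[\Delta_i(f\prec g) - f\,\Delta_i g\bigr]\Delta_j h.
\]
The second estimate of Lemma \ref{lem:2.2} controls the bracketed commutator pointwise by $2^{-i(\alpha+\beta)}\|f\|_{\CC^\alpha(\rho_1)}\|g\|_{\CC^\beta(\rho_2)}$ in $L^\infty(\rho_1\rho_2)$, while $\|\Delta_j h\|_{L^\infty(\rho_3)}\lesssim 2^{-j\gamma}\|h\|_{\CC^\gamma(\rho_3)}$, so each summand is bounded in $L^\infty(\rho_1\rho_2\rho_3)$ by $2^{-j(\alpha+\beta+\gamma)}$ times the product of the three norms. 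The assumption $\alpha+\beta+\gamma>0$ makes the resulting geometric series summable.

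In order to apply the Bernstein-type summation criterion of Lemma \ref{lem:bcd}(2) and conclude that the series lies in $\CC^{\alpha+\beta+\gamma}(\rho_1\rho_2\rho_3)$, each summand must be spectrally localized in a ball of radius of order $2^j$. The term $\Delta_i(f\prec g)\,\Delta_j h$ is the product of two annularly supported functions and poses no problem, but $f\,\Delta_i g\,\Delta_j h$ is not a priori localized since multiplication by $f$ spreads the spectrum. The standard remedy is to split $f=\Delta_{\leq i-2}f+\Delta_{>i-2}f$: the piece $(\Delta_{\leq i-2}f)(\Delta_i g)(\Delta_j h)$ has spectrum in a ball of radius of order $2^j$, while the high-frequency remainder is further decomposed dyadically as $\sum_{k\geq i-1}(\Delta_k f)(\Delta_i g)(\Delta_j h)$, each piece being annularly supported at scale $2^k$ and satisfying the same type of bound with the additional decay factor $2^{-k\alpha}$ coming from $\|\Delta_k f\|_{L^\infty(\rho_1)}$; the sum over $k$ converges again under the hypothesis $\alpha+\beta+\gamma>0$.

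The main technical point is this spectral bookkeeping; everything else is routine. Once the bound is established on smooth triples, uniqueness and existence of a continuous trilinear extension to $\CC^\alpha(\rho_1)\times\CC^\beta(\rho_2)\times\CC^\gamma(\rho_3)$ follow by a standard mollification argument. The additional hypothesis $\beta+\gamma<0$ plays no role in the proof of the bound itself; it identifies the regime in which the commutator operator is genuinely needed, since when $\beta+\gamma>0$ the resonant product $(f\prec g)\circ h$ is already well-defined directly by Lemma \ref{lem:para}.
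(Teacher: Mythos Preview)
Your proposal is correct and follows essentially the same approach as the paper, which simply states that the proof of \cite[Lemma~2.4]{GIP} goes through verbatim once Lemmas~\ref{lem:bcd}, \ref{lem:para}, and \ref{lem:2.2} are used in place of their unweighted counterparts. One minor caveat: smooth functions are not norm-dense in $\CC^\alpha=B^\alpha_{\infty,\infty}$, so the extension should be obtained by showing that the series representation you wrote down converges directly for arbitrary $f,g,h$ in the respective weighted spaces (exactly as in GIP), rather than by a density argument.
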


\begin{proof}
In view of Lemma \ref{lem:bcd}, Lemma \ref{lem:para}, Lemma \ref{lem:2.2}, the proof is the same as the proof of \cite[Lemma 2.4]{GIP}.
\end{proof}

%
%

Moreover, we will make use of the time-mollified paraproducts as introduced in \cite[Section 5]{GIP}. Let $Q:\R\to\R_{+}$ be a smooth function, supported in $[-1,1]$ and $\int_{\R}Q(s)\mathrm{d}s=1$, and for $i\geq -1$ define the operator $Q_{i}:C\CC^{\alpha}(\rho)\to C\CC^{\alpha}(\rho)$ by
$$
Q_{i}f(t)=\int_{\R}2^{2i}Q(2^{2i}(t-s))f(s\vee 0)\mathrm{d} s.
$$
Finally, we define the modified paraproduct of $f,g\in C\CC^{\alpha}(\rho)$ by
$$
f\Prec g := \sum_{i\geq -1}(S_{i-1}Q_{i}f)\Delta_{i} g.
$$
Setting  $\LL=\partial_{t} +(-\Delta+\mu)$, the following useful properties of this paraproduct in weighted Besov spaces  can be shown similarly to \cite[Lemma 5.1]{GIP}. Here we denote by $[\LL,f\Prec]$ the commutator between $\LL$ and $f\Prec$, that is, $[\LL,f\Prec]g=\LL (f\Prec g) -f\Prec(\LL g)$.

\begin{lemma}\label{lem:5.1}
Let $\rho_{1},\rho_{2}$ be admissible space-time  weights. Let $\alpha\in (0,1),$ $\beta\in \R$, and let $f\in C\CC^{\alpha}(\rho_{1})\cap C^{\alpha/2}L^{\infty}(\rho_{1})$ and $g\in C\CC^{\beta}(\rho_{2})$. Then
$$
\big\|[\LL,f\Prec] g\big\|_{C\CC^{\alpha+\beta-2}(\rho_{1}\rho_{2})}\lesssim \big( \|f\|_{C^{\alpha/2}L^{\infty}(\rho_{1})}+\|f\|_{C\CC^{\alpha}(\rho_{1})} \big)\|g\|_{C\CC^{\beta}(\rho_{2})},
$$
and
$$
\|f\prec g-f\Prec g\|_{C\CC^{\alpha+\beta}(\rho_{1}\rho_{2})}\lesssim \|f\|_{C^{\alpha/2}L^{\infty}(\rho_{1})}\|g\|_{C\CC^{\beta}(\rho_{2})}.
$$
\end{lemma}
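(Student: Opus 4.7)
The plan is to adapt the unweighted argument of \cite[Lemma 5.1]{GIP} to the present weighted setting, leveraging the weighted paraproduct bound (Lemma \ref{lem:para}) and the weighted spectral support criterion (Lemma \ref{lem:bcd}). The central observation is that the mollifier $Q_i$ smooths $f$ in time at the parabolic scale $2^{-2i}$, precisely matched to the spatial scale $2^{-i}$ of $\Delta_i$, so that one extra time derivative on $S_{i-1}Q_if$ costs $2^{2i}$ but is compensated by $\alpha/2$-Hölder regularity in time, yielding $2^{(2-\alpha)i}$. Throughout, I will exploit the equivalent characterization \eqref{eq:teq} of the $C^{\alpha/2}L^\infty(\rho_1)$ norm so that increments of $f$ can be estimated with the weight at the later time.

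For the commutator, I would apply the identity $\LL(uv)-u\,\LL v=(\partial_t u-\Delta u)v-2\nabla u\cdot\nabla v$ term by term with $u=S_{i-1}Q_if$ and $v=\Delta_ig$, so that
\[
[\LL,f\Prec]g=\sum_{i\geq-1}\Bigl[(\partial_tS_{i-1}Q_if)\Delta_ig-(\Delta S_{i-1}Q_if)\Delta_ig-2\nabla(S_{i-1}Q_if)\cdot\nabla\Delta_ig\Bigr].
\]
Each summand has spatial Fourier support in an annulus of radii $\sim 2^i$ (the first factor is frequency-localized in a ball of radius $2^{i-1}$, the second in an annulus $2^i\mathcal A$). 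Writing $\partial_tQ_if(t)=\int 2^{4i}Q'(2^{2i}(t-s))(f(s\vee 0)-f(t))\mathrm ds$ and using $\int Q'=0$, the time-Hölder bound gives $\|\partial_tS_{i-1}Q_if\|_{CL^\infty(\rho_1)}\lesssim 2^{(2-\alpha)i}\|f\|_{C^{\alpha/2}L^\infty(\rho_1)}$, while Bernstein yields $\|\Delta S_{i-1}Q_if\|_{CL^\infty(\rho_1)}\lesssim 2^{(2-\alpha)i}\|f\|_{C\CC^\alpha(\rho_1)}$ and $\|\nabla S_{i-1}Q_if\|_{CL^\infty(\rho_1)}\lesssim 2^{(1-\alpha)i}\|f\|_{C\CC^\alpha(\rho_1)}$. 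Combining these with $\|\Delta_ig\|_{CL^\infty(\rho_2)}\lesssim 2^{-i\beta}\|g\|_{C\CC^\beta(\rho_2)}$ and the analogous gradient bound, each summand is controlled in $L^\infty(\rho_1\rho_2)$ by $2^{i(2-\alpha-\beta)}(\|f\|_{C^{\alpha/2}L^\infty(\rho_1)}+\|f\|_{C\CC^\alpha(\rho_1)})\|g\|_{C\CC^\beta(\rho_2)}$, and the first claim follows from Lemma \ref{lem:bcd}(1).

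For the approximation estimate, I would write
\[
f\prec g-f\Prec g=\sum_{i\geq-1}S_{i-1}\bigl(f-Q_if\bigr)\,\Delta_ig,
\]
and observe that $f(t)-Q_if(t)=\int 2^{2i}Q(2^{2i}(t-s))(f(t)-f(s\vee 0))\mathrm ds$, so that by \eqref{eq:teq} and the support of $Q$,
\[
\|f(t)-Q_if(t)\|_{L^\infty(\rho_1(t))}\lesssim 2^{-i\alpha}\|f\|_{C^{\alpha/2}L^\infty(\rho_1)}.
\]
Since $S_{i-1}$ is bounded on $L^\infty(\rho_1)$, the same bound holds for $S_{i-1}(f-Q_if)$. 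Each summand again has spatial Fourier support in an annulus of radii $\sim 2^i$, and a second application of Lemma \ref{lem:bcd}(1) yields the desired estimate. The main obstacle I anticipate is purely bookkeeping: making sure that the translation-in-time step $s\mapsto s\vee 0$ in the definition of $Q_i$ does not spoil the $2^{-i\alpha}$ gain near $t=0$, which is handled by bounding $\|f(t)-f(0)\|_{L^\infty(\rho_1(t))}\lesssim t^{\alpha/2}$ and using $t\leqslant 2^{-2i}$ on the relevant part of the support.
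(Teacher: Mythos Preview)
Your proposal is correct and follows exactly the route the paper indicates: the paper does not give a detailed proof but simply states that the lemma ``can be shown similarly to \cite[Lemma 5.1]{GIP}'', and your argument is precisely that adaptation, using the weighted tools Lemma~\ref{lem:bcd} and Lemma~\ref{lem:para} in place of their unweighted counterparts. The handling of the $s\vee 0$ cutoff near $t=0$ and the use of \eqref{eq:teq} to place the weight at the later time are the only points requiring care beyond the unweighted case, and you have dealt with both correctly.
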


\section{Probabilistic analysis}
\label{s:proba}

\subsection{Space white noise}
\label{ssec:renorm-el}

Let $\xi$ be a space white noise on $\R^{d}$, that is, a family of centered Gaussian random variables $\{\xi(h);\,h\in L^{2}(\R^{d}) \}$ such that
$$
\mathbb{E}[\xi(h)^{2}]=\|h\|^{2}_{L^{2}}.
$$
Let $\xi_{M}$ denote its periodization on $\mathbb{T}^{d}_{M}=(M\mathbb{T})^{d}=\left[-\frac{M}{2},\frac{M}{2}\right]^{d}$ given by
$$
\xi_{M}(h):=\xi(h_{M}),\qquad\text{where }\ h_{M}(x)=\mathbf{1}_{[-\frac{M}{2},\frac{M}{2}]^{d}}(x)\sum_{y\in M\mathbb{Z}^{d}}h(x+y).
$$
Let 
\begin{equation*}
 \Q X = \xi, \qquad  \Q X_{M} = \xi_{M},
\end{equation*}
and denote by $  \llbracket X^2 \rrbracket, \llbracket X^3 \rrbracket $ and $  \llbracket X_{M}^2 \rrbracket, \llbracket X_{M}^3 \rrbracket $ the corresponding Wick powers. They can be constructed by using a suitable mollification $\xi_{\varepsilon}=\xi*\eta_{\varepsilon}$ and $\xi_{M,\varepsilon}=\xi_{M}*\eta_{\varepsilon}$ (where $\eta_{\varepsilon}$ stands for a smoothing kernel) and setting
$$
 \Q X_{\varepsilon} = \xi_{\varepsilon}, \qquad  \Q X_{M,\varepsilon} = \xi_{M,\varepsilon},
$$
$$
 \llbracket X^2 \rrbracket  \assign \lim_{\varepsilon\to 0}  \llbracket X_{\varepsilon}^2 \rrbracket  \assign \lim_{\varepsilon\to 0}  X_{\varepsilon}^2 - a_{\varepsilon}, \qquad   \llbracket X^3 \rrbracket\assign\lim_{\varepsilon\to 0}  \llbracket X_{\varepsilon}^3 \rrbracket\assign \lim_{\varepsilon\to 0} X_{\varepsilon}^3 - 3 a_{\varepsilon} X_{\varepsilon},
 $$
 $$
 \llbracket X_{M}^2 \rrbracket  \assign \lim_{\varepsilon\to 0}  \llbracket X_{M,\varepsilon}^2 \rrbracket  \assign \lim_{\varepsilon\to 0}  X_{M,\varepsilon}^2 - a_{M,\varepsilon}, \qquad   \llbracket X_{M}^3 \rrbracket\assign\lim_{\varepsilon\to 0} \llbracket X_{M,\varepsilon}^3 \rrbracket\assign\lim_{\varepsilon\to 0} X_{M,\varepsilon}^3 - 3 a_{M,\varepsilon} X_{M,\varepsilon},
 $$
 where $a_{\varepsilon}= \mathbbm{E}[X_{\varepsilon}^2(0)]$  and $a_{M,\varepsilon}= \mathbbm{E}[X_{M,\varepsilon}^2(0)]$ are  constants diverging  as $\varepsilon\to0$ and the limits are understood in a suitable Besov space a.s. More precisely, the following result holds.

 \begin{theorem}\label{thm:renorm}
Let $d=4$. Let $\rho(x)=\langle x\rangle^{-\nu}$ for some $\nu>0$. Then there exist random distributions $X,\llbracket X^{2}\rrbracket,\llbracket X^{3}\rrbracket$ and $X_{M},\llbracket X_{M}^{2}\rrbracket,\llbracket X_{M}^{3}\rrbracket$ given by the formulas above, such that for every $\kappa,\sigma>0$  it holds 
$$
\|X\|_{\CC^{-\kappa}(\rho^{\sigma})},\|\llbracket X^{2}\rrbracket\|_{\CC^{-\kappa}(\rho^{\sigma})}
,\|\llbracket X^{3}\rrbracket\|_{\CC^{-\kappa}(\rho^{\sigma})}\lesssim 1,
$$
$$
\|X_{M}\|_{\CC^{-\kappa}(\mathbb{T}^{4}_{M})},\|\llbracket X_{M}^{2}\rrbracket\|_{\CC^{-\kappa}(\mathbb{T}^{4}_{M})}
,\|\llbracket X_{M}^{3}\rrbracket\|_{\CC^{-\kappa}(\mathbb{T}^{4}_{M})}\lesssim 1,
$$
and in addition $X_{M}\to X$, $\llbracket X_{M}^{2}\rrbracket\to \llbracket X^{2}\rrbracket$, $\llbracket X_{M}^{3}\rrbracket\to \llbracket X^{3}\rrbracket$ in $\CC^{-\kappa}(\rho^{\sigma})$ a.s. as $M\to\infty$. \end{theorem}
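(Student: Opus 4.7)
The overall strategy is the standard probabilistic construction for renormalized polynomials of a Gaussian field, adapted here to the weighted Besov setting. I would first construct the mollified objects $X_\varepsilon = \Q^{-1} \xi_\varepsilon$ and $X_{M,\varepsilon} = \Q^{-1} \xi_{M,\varepsilon}$ together with their Wick powers, then prove uniform-in-$\varepsilon$ bounds in $\CC^{-\kappa}(\rho^\sigma)$ and Cauchy estimates as $\varepsilon\to 0$, thereby identifying the limits $X$, $\llbracket X^2\rrbracket$, $\llbracket X^3\rrbracket$. A parallel argument, which is cleaner because the periodic torus is compact, yields the same bounds for the periodic objects uniformly in $M$. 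Finally I would combine these two constructions to prove the convergence $X_M\to X$ (and similarly for the Wick powers) in $\CC^{-\kappa}(\rho^\sigma)$.

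The heart of the argument is a pointwise second-moment estimate of Littlewood--Paley blocks. By translation invariance of white noise and Gaussian computation, for each $x\in\R^d$,
\[
\mathbbm{E}[|\Delta_j X_\varepsilon(x)|^2]=\int |\mathcal{F}^{-1}(\chi_j \hat G_\varepsilon)|^2\,\mathd y,
\]
where $G_\varepsilon$ is the Green's kernel of $\Q$ mollified by $\eta_\varepsilon$ and $\chi_j$ is the Littlewood--Paley cutoff. A scaling argument in $d=4$ shows this is bounded uniformly in $x$, $\varepsilon$, and $j$ up to a logarithmic factor, which is absorbed into any $2^{-j\kappa}$. Wick's formula and analogous graph estimates handle $\llbracket X^2_\varepsilon\rrbracket$ and $\llbracket X^3_\varepsilon\rrbracket$, giving $L^2(\Omega)$-bounds of the same form on their second and third chaos components. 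Gaussian hypercontractivity on the fixed $n$-th Wiener chaos upgrades each $L^2(\Omega)$-bound to an $L^p(\Omega)$-bound for any finite $p$, with a constant depending only on $n$ and $p$.

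To pass from pointwise moments to the weighted norm, I use the equivalence $\|f\|_{\CC^{-\kappa}(\rho^\sigma)}\sim \sup_j 2^{-j\kappa}\|\rho^\sigma\Delta_j f\|_{L^\infty}$ together with the Besov embedding $B^s_{p,p}(\R^d)\hookrightarrow L^\infty(\R^d)$ for $s>d/p$. Then
\[
\mathbbm{E}\|\rho^\sigma \Delta_j f\|_{L^\infty}^p \lesssim \int_{\R^d} \rho(x)^{\sigma p}\,\mathbbm{E}[|\Delta_j f(x)|^p]\,\mathd x + (\text{Besov correction}),
\]
which is finite provided $\sigma p > d$. Choosing $p$ sufficiently large and combining with the pointwise bounds yields $\mathbbm{E}\|X_\varepsilon\|_{\CC^{-\kappa}(\rho^\sigma)}^p$, $\mathbbm{E}\|\llbracket X_\varepsilon^2\rrbracket\|_{\CC^{-\kappa}(\rho^\sigma)}^p$, $\mathbbm{E}\|\llbracket X_\varepsilon^3\rrbracket\|_{\CC^{-\kappa}(\rho^\sigma)}^p$ uniformly bounded in $\varepsilon$. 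Applying the same machinery to differences and exploiting that $\mathbbm{E}[|\Delta_j(\llbracket X^n_\varepsilon\rrbracket-\llbracket X^n_{\varepsilon'}\rrbracket)(x)|^2]\to 0$ as $\varepsilon,\varepsilon'\to 0$ (by dominated convergence on the explicit kernel expressions) gives the Cauchy property and hence the limits $X,\llbracket X^2\rrbracket,\llbracket X^3\rrbracket$. The periodic bounds follow identically, the only change being that integrals over $\R^d$ are replaced by sums over $\mathbb{Z}^d$ with uniform-in-$M$ constants.

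For the convergence $X_M\to X$, and similarly for the Wick powers, I would rely on the identity $\xi_M(h)-\xi(h)=\xi(h_M-h)$ with $\|h_M-h\|_{L^2}\to 0$ for any compactly supported $h\in L^2$. This translates into $\mathbbm{E}[|\Delta_j(X_{M}-X)(x)|^2]\to 0$ for each fixed $j$ and $x$, with an $M$-uniform pointwise bound obtained from the same moment estimates as above. The weight $\rho^\sigma$ with $\sigma>0$ is precisely what allows the tail contribution from $|x|\gtrsim M$ to be absorbed, so that a dominated convergence plus Besov-embedding argument (as in the previous paragraph) promotes the pointwise convergence to convergence in $\CC^{-\kappa}(\rho^\sigma)$, first in $L^p(\Omega)$ and then almost surely along a subsequence, upgraded to full a.s.\ convergence by a standard Borel--Cantelli argument using the uniform moment bounds. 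The main obstacle is the combinatorial bookkeeping in the third Wiener chaos for $\llbracket X^3\rrbracket$, where Wick's formula produces several distinct contractions with different divergence structures, each of which must be controlled simultaneously both in $\varepsilon$ (for the Cauchy property) and in $M$ (for the periodic-to-full-space limit); organizing these estimates so that the $d=4$ logarithmic borderline scaling does not produce a genuine divergence is the delicate point of the proof.
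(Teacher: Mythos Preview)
Your proposal is correct and follows essentially the same route as the paper: pointwise moment bounds on Littlewood--Paley blocks via Wiener chaos computations, hypercontractivity to pass to $L^p(\Omega)$, and then a Besov/Bernstein embedding combined with integrability of $\rho^{\sigma p}$ to reach the weighted $L^\infty$ scale.

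The one place where the paper proceeds differently, and more cleanly, is the coupling for the convergence $X_M\to X$. Rather than working through the periodization identity $\xi_M(h)=\xi(h_M)$ and then having to track separately the convergence of the periodic Green's function to the full-space one, the paper writes both $X$ and $X_M$ as stochastic integrals against the \emph{same} white noise $W$ on $\R^4$, with $X_M$ obtained by replacing each frequency $\theta$ by its grid discretization $[\theta]_M$. In this representation the periodic Green's function is built in automatically, and the difference $\llbracket X^n_M\rrbracket-\llbracket X^n\rrbracket$ is a single multiple Wiener integral whose kernel converges pointwise, so dominated convergence gives the moment decay directly. Your approach via $h_M-h$ is also valid, but you would need an extra step to control the mismatch between the torus and full-space inverses of $\Q$; the paper's Fourier-space coupling absorbs that issue for free.
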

 
 \begin{proof} We give a sketch of the proof since similar arguments are already present in the literature on parabolic $\Phi^4_d$ models, and in particular in the work of Mourrat and Weber~\cite{MW17}. Following the approach of Gubinelli and Perkowski~\cite{gubinelli_kpz_2017} we represent the random fields $X$ and $X_M$ as Wiener integrals over a white noise $W$ on $\R^4$. As a consequence we can write 
 \[ X (x) = \int_{\mathbbm{R}^4} e^{2 \pi i \theta \cdot x} \frac{W (\mathd
   \theta)}{\mu + | \theta |^2}, \qquad X_M (x) = \int_{\mathbbm{R}^4} e^{2
   \pi i [\theta]_M \cdot x} \frac{W (\mathd \theta)}{\mu + | [\theta]_M |^2},
\]
where $([\theta]_M)^i = M^{- 1} \lfloor M \theta^i - 1 / 2 \rfloor$, $i = 1,
\ldots, d$ is the discretization of $\theta\in\R^4$ on a grid of size $M^{-1}$. The reader can check that this gives a periodic random field with the correct covariance. Wick powers of $X$ (or $X_M$) can then be expressed as multiple Wiener integrals over $W$. We present the details for $\llbracket X^3 \rrbracket $:
\[ \llbracket X^3 \rrbracket (x) = \int_{(\mathbbm{R}^4)^3} e^{2 \pi i
   (\theta_1 + \theta_2 + \theta_3) \cdot x} \frac{W (\mathd \theta_1 \mathd
   \theta_2 \mathd \theta_3)}{\prod_{i = 1}^3 (\mu + | \theta_i |^2)}, \]
\[ \llbracket X^3_M \rrbracket (x) = \int_{(\mathbbm{R}^4)^3} e^{2 \pi i
   ([\theta_1]_M + [\theta_2]_M + [\theta_3]_M) \cdot x} \frac{W (\mathd
   \theta_1 \mathd \theta_2 \mathd \theta_3)}{\prod_{i = 1}^3 (\mu + |
   [\theta_i]_M |^2)} . \]
And $L^2$ bound on the Littlewood--Paley block of these quantities reads, for $k\geq -1$,
\[ \mathbbm{E} [| \Delta_k \llbracket X^3_M \rrbracket (x) |^2] =
   \int_{(\mathbbm{R}^4)^3} K_k ([\theta_1]_M + [\theta_2]_M +
   [\theta_3]_M)^2 \frac{\mathd \theta_1 \mathd \theta_2 \mathd
   \theta_3}{\prod_{i = 1}^3 (\mu + | [\theta_i]_M |^2)^2} \]
\[ \lesssim \int_{(\mathbbm{R}^4)^3} K_k (\theta_1 + \theta_2 + \theta_3)^2
   \frac{\mathd \theta_1 \mathd \theta_2 \mathd \theta_3}{\prod_{i = 1}^3 (\mu
   + | \theta_i |^2)^2} \lesssim 1, \]
where $K_{k}$ is the Fourier multiplier associated with $\Delta_k$. 
From this we deduce by hypercontractivity that $\mathbbm{E} [| \Delta_k
\llbracket X^3_M \rrbracket (x) |^p] \lesssim 1$ and therefore that
\[ \mathbbm{E} [\| \Delta_k \llbracket X^3_M \rrbracket \|_{L^p
   (\rho^{\sigma})}^p] \lesssim \int_{\mathbbm{R}^4} \mathbbm{E} [| \Delta_k
   \llbracket X^3_M \rrbracket (x) |^p] \rho (x)^{\sigma p} \mathd x \lesssim
   \int_{\mathbbm{R}^4} \rho (x)^{\sigma p} \mathd x \lesssim 1, \]
for $p$ sufficiently large so that the space integral is finite. As a
consequence of Bernstein inequality it follows that
\[ \| \Delta_k \llbracket X^3_M \rrbracket \|_{L^{\infty} (\rho^{\sigma})}
   \lesssim 2^{4 k / p} \| \Delta_k \llbracket X^3_M \rrbracket \|_{L^p
   (\rho^{\sigma})} \]
and therefore 
\[ \mathbbm{E} (\|  \llbracket X^3_M \rrbracket \|_{C^{- \kappa}
   (\rho^{\sigma})}^p) < \infty, \]
for $p$ large enough and $\kappa > 0$ small. Convergence of $\llbracket X^3_M
\rrbracket$ to $\llbracket X^3 \rrbracket$ can be handled by coupling,
observing that estimation of $\llbracket X^3_M \rrbracket - \llbracket X^3
\rrbracket$ involves computations similar to the above. Indeed, it holds
\[ \mathbbm{E} [| \Delta_k (\llbracket X^3_M \rrbracket - \llbracket X^3
   \rrbracket) (x) |^2] = \int_{(\mathbbm{R}^4)^3} \left( \frac{K_k
   ([\theta_1]_M + [\theta_2]_M + [\theta_3]_M)}{\prod_{i = 1}^3 (\mu + |
   [\theta_i]_M |^2)} - \frac{K_k (\theta_1 + \theta_2 +
   \theta_3)}{\prod_{i = 1}^3 (\mu + | \theta_i |^2)} \right)^2 \mathd
   \theta_1 \mathd \theta_2 \mathd \theta_3 \]
which by dominated convergence tends to zero as $M \rightarrow \infty$.
Therefore we can estimate
\[ \mathbbm{E} [\| \Delta_k (\llbracket X^3_M \rrbracket - \llbracket X^3
   \rrbracket) \|_{L^p (\rho^{\sigma})}^p] \lesssim \int_{\mathbbm{R}^4}
   \mathbbm{E} [| \Delta_k (\llbracket X^3_M \rrbracket - \llbracket X^3
   \rrbracket) (x) |^p] \rho (x)^{\sigma p} \mathd x \lesssim o_M (1) . \]
 \end{proof}
 
 This result will be used for the study of elliptic $\Phi^{4}$ model in dimension 4, see Section \ref{sec:44}. When $d=5$ then the space white noise becomes more irregular and our analysis requires additional probabilistic objects. More precisely, we let
\[ \Q X^{\!\resizebox{0.6em}{!}{
\begin{tikzpicture}
\pgfpathmoveto{\pgfqpoint{0cm}{-0.035cm}}
\pgfpathlineto{\pgfqpoint{1.376cm}{-0.035cm}}
\pgfpathlineto{\pgfqpoint{1.376cm}{1.552cm}}
\pgfpathlineto{\pgfqpoint{0cm}{1.552cm}}
\pgfpathclose
\pgfusepath{clip}
\begin{pgfscope}
\begin{pgfscope}
\pgfpathmoveto{\pgfqpoint{0cm}{-0.035cm}}
\pgfpathlineto{\pgfqpoint{1.376cm}{-0.035cm}}
\pgfpathlineto{\pgfqpoint{1.376cm}{1.552cm}}
\pgfpathlineto{\pgfqpoint{0cm}{1.552cm}}
\pgfpathclose
\pgfusepath{clip}
\begin{pgfscope}
\begin{pgfscope}
\pgfsetdash{}{0cm}
\pgfsetlinewidth{0.818mm}
\pgfsetroundcap
\pgfsetroundjoin
\pgfsetmiterlimit{7.0}
\definecolor{eps2pgf_color}{gray}{0}\pgfsetstrokecolor{eps2pgf_color}\pgfsetfillcolor{eps2pgf_color}
\pgfpathmoveto{\pgfqpoint{0.117cm}{1.421cm}}
\pgfpathlineto{\pgfqpoint{0.682cm}{0.671cm}}
\pgfpathlineto{\pgfqpoint{1.246cm}{1.421cm}}
\pgfusepath{stroke}
\end{pgfscope}
\definecolor{eps2pgf_color}{gray}{0}\pgfsetstrokecolor{eps2pgf_color}\pgfsetfillcolor{eps2pgf_color}
\pgfpathmoveto{\pgfqpoint{0.273cm}{1.395cm}}
\pgfpathcurveto{\pgfqpoint{0.273cm}{1.432cm}}{\pgfqpoint{0.259cm}{1.467cm}}{\pgfqpoint{0.233cm}{1.492cm}}
\pgfpathcurveto{\pgfqpoint{0.207cm}{1.518cm}}{\pgfqpoint{0.173cm}{1.532cm}}{\pgfqpoint{0.137cm}{1.532cm}}
\pgfpathcurveto{\pgfqpoint{0.1cm}{1.532cm}}{\pgfqpoint{0.066cm}{1.518cm}}{\pgfqpoint{0.04cm}{1.492cm}}
\pgfpathcurveto{\pgfqpoint{0.014cm}{1.467cm}}{\pgfqpoint{0cm}{1.432cm}}{\pgfqpoint{0cm}{1.395cm}}
\pgfpathcurveto{\pgfqpoint{0cm}{1.359cm}}{\pgfqpoint{0.014cm}{1.324cm}}{\pgfqpoint{0.04cm}{1.299cm}}
\pgfpathcurveto{\pgfqpoint{0.066cm}{1.273cm}}{\pgfqpoint{0.1cm}{1.258cm}}{\pgfqpoint{0.137cm}{1.258cm}}
\pgfpathcurveto{\pgfqpoint{0.173cm}{1.258cm}}{\pgfqpoint{0.207cm}{1.273cm}}{\pgfqpoint{0.233cm}{1.299cm}}
\pgfpathcurveto{\pgfqpoint{0.259cm}{1.324cm}}{\pgfqpoint{0.273cm}{1.359cm}}{\pgfqpoint{0.273cm}{1.395cm}}
\pgfusepath{fill}
\begin{pgfscope}
\pgfsetdash{}{0cm}
\pgfsetlinewidth{0.818mm}
\pgfsetmiterlimit{7.0}
\pgfpathmoveto{\pgfqpoint{0.682cm}{0.671cm}}
\pgfpathlineto{\pgfqpoint{0.679cm}{1.418cm}}
\pgfusepath{stroke}
\end{pgfscope}
\pgfpathmoveto{\pgfqpoint{0.815cm}{1.399cm}}
\pgfpathcurveto{\pgfqpoint{0.815cm}{1.435cm}}{\pgfqpoint{0.801cm}{1.47cm}}{\pgfqpoint{0.775cm}{1.496cm}}
\pgfpathcurveto{\pgfqpoint{0.75cm}{1.521cm}}{\pgfqpoint{0.715cm}{1.536cm}}{\pgfqpoint{0.679cm}{1.536cm}}
\pgfpathcurveto{\pgfqpoint{0.643cm}{1.536cm}}{\pgfqpoint{0.608cm}{1.521cm}}{\pgfqpoint{0.582cm}{1.496cm}}
\pgfpathcurveto{\pgfqpoint{0.557cm}{1.47cm}}{\pgfqpoint{0.542cm}{1.435cm}}{\pgfqpoint{0.542cm}{1.399cm}}
\pgfpathcurveto{\pgfqpoint{0.542cm}{1.363cm}}{\pgfqpoint{0.557cm}{1.328cm}}{\pgfqpoint{0.582cm}{1.302cm}}
\pgfpathcurveto{\pgfqpoint{0.608cm}{1.276cm}}{\pgfqpoint{0.643cm}{1.262cm}}{\pgfqpoint{0.679cm}{1.262cm}}
\pgfpathcurveto{\pgfqpoint{0.715cm}{1.262cm}}{\pgfqpoint{0.75cm}{1.276cm}}{\pgfqpoint{0.775cm}{1.302cm}}
\pgfpathcurveto{\pgfqpoint{0.801cm}{1.328cm}}{\pgfqpoint{0.815cm}{1.363cm}}{\pgfqpoint{0.815cm}{1.399cm}}
\pgfusepath{fill}
\pgfpathmoveto{\pgfqpoint{1.345cm}{1.371cm}}
\pgfpathcurveto{\pgfqpoint{1.345cm}{1.408cm}}{\pgfqpoint{1.331cm}{1.442cm}}{\pgfqpoint{1.305cm}{1.468cm}}
\pgfpathcurveto{\pgfqpoint{1.28cm}{1.494cm}}{\pgfqpoint{1.245cm}{1.508cm}}{\pgfqpoint{1.209cm}{1.508cm}}
\pgfpathcurveto{\pgfqpoint{1.172cm}{1.508cm}}{\pgfqpoint{1.138cm}{1.494cm}}{\pgfqpoint{1.112cm}{1.468cm}}
\pgfpathcurveto{\pgfqpoint{1.087cm}{1.442cm}}{\pgfqpoint{1.072cm}{1.408cm}}{\pgfqpoint{1.072cm}{1.371cm}}
\pgfpathcurveto{\pgfqpoint{1.072cm}{1.335cm}}{\pgfqpoint{1.087cm}{1.3cm}}{\pgfqpoint{1.112cm}{1.274cm}}
\pgfpathcurveto{\pgfqpoint{1.138cm}{1.249cm}}{\pgfqpoint{1.172cm}{1.234cm}}{\pgfqpoint{1.209cm}{1.234cm}}
\pgfpathcurveto{\pgfqpoint{1.245cm}{1.234cm}}{\pgfqpoint{1.28cm}{1.249cm}}{\pgfqpoint{1.305cm}{1.274cm}}
\pgfpathcurveto{\pgfqpoint{1.331cm}{1.3cm}}{\pgfqpoint{1.345cm}{1.335cm}}{\pgfqpoint{1.345cm}{1.371cm}}
\pgfusepath{fill}
\begin{pgfscope}
\pgfsetdash{}{0cm}
\pgfsetlinewidth{0.818mm}
\pgfsetroundcap
\pgfsetmiterlimit{4.0}
\pgfpathmoveto{\pgfqpoint{0.682cm}{0.671cm}}
\pgfpathlineto{\pgfqpoint{0.682cm}{0.042cm}}
\pgfusepath{stroke}
\end{pgfscope}
\end{pgfscope}
\end{pgfscope}
\end{pgfscope}
\end{tikzpicture}}} = \llbracket X^3 \rrbracket, \quad \Q
   X^{\!\resizebox{0.6em}{!}{
\begin{tikzpicture}
\pgfpathmoveto{\pgfqpoint{0cm}{0cm}}
\pgfpathlineto{\pgfqpoint{1.376cm}{0cm}}
\pgfpathlineto{\pgfqpoint{1.376cm}{1.588cm}}
\pgfpathlineto{\pgfqpoint{0cm}{1.588cm}}
\pgfpathclose
\pgfusepath{clip}
\begin{pgfscope}
\begin{pgfscope}
\pgfpathmoveto{\pgfqpoint{0cm}{0cm}}
\pgfpathlineto{\pgfqpoint{1.376cm}{0cm}}
\pgfpathlineto{\pgfqpoint{1.376cm}{1.588cm}}
\pgfpathlineto{\pgfqpoint{0cm}{1.588cm}}
\pgfpathclose
\pgfusepath{clip}
\begin{pgfscope}
\begin{pgfscope}
\definecolor{eps2pgf_color}{gray}{0.976471}\pgfsetstrokecolor{eps2pgf_color}\pgfsetfillcolor{eps2pgf_color}
\pgfpathmoveto{\pgfqpoint{0cm}{0cm}}
\pgfpathlineto{\pgfqpoint{1.376cm}{0cm}}
\pgfpathlineto{\pgfqpoint{1.376cm}{1.588cm}}
\pgfpathlineto{\pgfqpoint{0cm}{1.588cm}}
\pgfpathclose
\pgfusepath{fill}
\end{pgfscope}
\begin{pgfscope}
\pgfsetdash{}{0cm}
\pgfsetlinewidth{0.818mm}
\pgfsetroundcap
\pgfsetroundjoin
\pgfsetmiterlimit{7.0}
\definecolor{eps2pgf_color}{gray}{0}\pgfsetstrokecolor{eps2pgf_color}\pgfsetfillcolor{eps2pgf_color}
\pgfpathmoveto{\pgfqpoint{0.117cm}{1.476cm}}
\pgfpathlineto{\pgfqpoint{0.682cm}{0.726cm}}
\pgfpathlineto{\pgfqpoint{1.246cm}{1.476cm}}
\pgfusepath{stroke}
\end{pgfscope}
\definecolor{eps2pgf_color}{gray}{0}\pgfsetstrokecolor{eps2pgf_color}\pgfsetfillcolor{eps2pgf_color}
\pgfpathmoveto{\pgfqpoint{0.273cm}{1.451cm}}
\pgfpathcurveto{\pgfqpoint{0.273cm}{1.487cm}}{\pgfqpoint{0.259cm}{1.522cm}}{\pgfqpoint{0.233cm}{1.547cm}}
\pgfpathcurveto{\pgfqpoint{0.207cm}{1.573cm}}{\pgfqpoint{0.173cm}{1.588cm}}{\pgfqpoint{0.137cm}{1.588cm}}
\pgfpathcurveto{\pgfqpoint{0.1cm}{1.588cm}}{\pgfqpoint{0.066cm}{1.573cm}}{\pgfqpoint{0.04cm}{1.547cm}}
\pgfpathcurveto{\pgfqpoint{0.014cm}{1.522cm}}{\pgfqpoint{0cm}{1.487cm}}{\pgfqpoint{0cm}{1.451cm}}
\pgfpathcurveto{\pgfqpoint{0cm}{1.414cm}}{\pgfqpoint{0.014cm}{1.379cm}}{\pgfqpoint{0.04cm}{1.354cm}}
\pgfpathcurveto{\pgfqpoint{0.066cm}{1.328cm}}{\pgfqpoint{0.1cm}{1.314cm}}{\pgfqpoint{0.137cm}{1.314cm}}
\pgfpathcurveto{\pgfqpoint{0.173cm}{1.314cm}}{\pgfqpoint{0.207cm}{1.328cm}}{\pgfqpoint{0.233cm}{1.354cm}}
\pgfpathcurveto{\pgfqpoint{0.259cm}{1.379cm}}{\pgfqpoint{0.273cm}{1.414cm}}{\pgfqpoint{0.273cm}{1.451cm}}
\pgfusepath{fill}
\pgfpathmoveto{\pgfqpoint{1.345cm}{1.426cm}}
\pgfpathcurveto{\pgfqpoint{1.345cm}{1.463cm}}{\pgfqpoint{1.331cm}{1.497cm}}{\pgfqpoint{1.305cm}{1.523cm}}
\pgfpathcurveto{\pgfqpoint{1.28cm}{1.549cm}}{\pgfqpoint{1.245cm}{1.563cm}}{\pgfqpoint{1.209cm}{1.563cm}}
\pgfpathcurveto{\pgfqpoint{1.172cm}{1.563cm}}{\pgfqpoint{1.138cm}{1.549cm}}{\pgfqpoint{1.112cm}{1.523cm}}
\pgfpathcurveto{\pgfqpoint{1.087cm}{1.497cm}}{\pgfqpoint{1.072cm}{1.463cm}}{\pgfqpoint{1.072cm}{1.426cm}}
\pgfpathcurveto{\pgfqpoint{1.072cm}{1.39cm}}{\pgfqpoint{1.087cm}{1.355cm}}{\pgfqpoint{1.112cm}{1.329cm}}
\pgfpathcurveto{\pgfqpoint{1.138cm}{1.304cm}}{\pgfqpoint{1.172cm}{1.289cm}}{\pgfqpoint{1.209cm}{1.289cm}}
\pgfpathcurveto{\pgfqpoint{1.245cm}{1.289cm}}{\pgfqpoint{1.28cm}{1.304cm}}{\pgfqpoint{1.305cm}{1.329cm}}
\pgfpathcurveto{\pgfqpoint{1.331cm}{1.355cm}}{\pgfqpoint{1.345cm}{1.39cm}}{\pgfqpoint{1.345cm}{1.426cm}}
\pgfusepath{fill}
\begin{pgfscope}
\pgfsetdash{}{0cm}
\pgfsetlinewidth{0.818mm}
\pgfsetroundcap
\pgfsetmiterlimit{4.0}
\pgfpathmoveto{\pgfqpoint{0.682cm}{0.726cm}}
\pgfpathlineto{\pgfqpoint{0.682cm}{0.097cm}}
\pgfusepath{stroke}
\end{pgfscope}
\end{pgfscope}
\end{pgfscope}
\end{pgfscope}
\end{tikzpicture}}} = \llbracket X^2 \rrbracket, \]
   \[ \Q X^{\!\resizebox{0.6em}{!}{
\begin{tikzpicture}
\pgfpathmoveto{\pgfqpoint{0cm}{-0.035cm}}
\pgfpathlineto{\pgfqpoint{1.376cm}{-0.035cm}}
\pgfpathlineto{\pgfqpoint{1.376cm}{1.552cm}}
\pgfpathlineto{\pgfqpoint{0cm}{1.552cm}}
\pgfpathclose
\pgfusepath{clip}
\begin{pgfscope}
\begin{pgfscope}
\pgfpathmoveto{\pgfqpoint{0cm}{-0.035cm}}
\pgfpathlineto{\pgfqpoint{1.376cm}{-0.035cm}}
\pgfpathlineto{\pgfqpoint{1.376cm}{1.552cm}}
\pgfpathlineto{\pgfqpoint{0cm}{1.552cm}}
\pgfpathclose
\pgfusepath{clip}
\begin{pgfscope}
\begin{pgfscope}
\pgfsetdash{}{0cm}
\pgfsetlinewidth{0.818mm}
\pgfsetroundcap
\pgfsetroundjoin
\pgfsetmiterlimit{7.0}
\definecolor{eps2pgf_color}{gray}{0}\pgfsetstrokecolor{eps2pgf_color}\pgfsetfillcolor{eps2pgf_color}
\pgfpathmoveto{\pgfqpoint{0.117cm}{1.421cm}}
\pgfpathlineto{\pgfqpoint{0.682cm}{0.671cm}}
\pgfpathlineto{\pgfqpoint{1.246cm}{1.421cm}}
\pgfusepath{stroke}
\end{pgfscope}
\definecolor{eps2pgf_color}{gray}{0}\pgfsetstrokecolor{eps2pgf_color}\pgfsetfillcolor{eps2pgf_color}
\pgfpathmoveto{\pgfqpoint{0.273cm}{1.395cm}}
\pgfpathcurveto{\pgfqpoint{0.273cm}{1.432cm}}{\pgfqpoint{0.259cm}{1.467cm}}{\pgfqpoint{0.233cm}{1.492cm}}
\pgfpathcurveto{\pgfqpoint{0.207cm}{1.518cm}}{\pgfqpoint{0.173cm}{1.532cm}}{\pgfqpoint{0.137cm}{1.532cm}}
\pgfpathcurveto{\pgfqpoint{0.1cm}{1.532cm}}{\pgfqpoint{0.066cm}{1.518cm}}{\pgfqpoint{0.04cm}{1.492cm}}
\pgfpathcurveto{\pgfqpoint{0.014cm}{1.467cm}}{\pgfqpoint{0cm}{1.432cm}}{\pgfqpoint{0cm}{1.395cm}}
\pgfpathcurveto{\pgfqpoint{0cm}{1.359cm}}{\pgfqpoint{0.014cm}{1.324cm}}{\pgfqpoint{0.04cm}{1.299cm}}
\pgfpathcurveto{\pgfqpoint{0.066cm}{1.273cm}}{\pgfqpoint{0.1cm}{1.258cm}}{\pgfqpoint{0.137cm}{1.258cm}}
\pgfpathcurveto{\pgfqpoint{0.173cm}{1.258cm}}{\pgfqpoint{0.207cm}{1.273cm}}{\pgfqpoint{0.233cm}{1.299cm}}
\pgfpathcurveto{\pgfqpoint{0.259cm}{1.324cm}}{\pgfqpoint{0.273cm}{1.359cm}}{\pgfqpoint{0.273cm}{1.395cm}}
\pgfusepath{fill}
\begin{pgfscope}
\pgfsetdash{}{0cm}
\pgfsetlinewidth{0.818mm}
\pgfsetmiterlimit{7.0}
\pgfpathmoveto{\pgfqpoint{0.682cm}{0.671cm}}
\pgfpathlineto{\pgfqpoint{0.679cm}{1.418cm}}
\pgfusepath{stroke}
\end{pgfscope}
\pgfpathmoveto{\pgfqpoint{0.815cm}{1.399cm}}
\pgfpathcurveto{\pgfqpoint{0.815cm}{1.435cm}}{\pgfqpoint{0.801cm}{1.47cm}}{\pgfqpoint{0.775cm}{1.496cm}}
\pgfpathcurveto{\pgfqpoint{0.75cm}{1.521cm}}{\pgfqpoint{0.715cm}{1.536cm}}{\pgfqpoint{0.679cm}{1.536cm}}
\pgfpathcurveto{\pgfqpoint{0.643cm}{1.536cm}}{\pgfqpoint{0.608cm}{1.521cm}}{\pgfqpoint{0.582cm}{1.496cm}}
\pgfpathcurveto{\pgfqpoint{0.557cm}{1.47cm}}{\pgfqpoint{0.542cm}{1.435cm}}{\pgfqpoint{0.542cm}{1.399cm}}
\pgfpathcurveto{\pgfqpoint{0.542cm}{1.363cm}}{\pgfqpoint{0.557cm}{1.328cm}}{\pgfqpoint{0.582cm}{1.302cm}}
\pgfpathcurveto{\pgfqpoint{0.608cm}{1.276cm}}{\pgfqpoint{0.643cm}{1.262cm}}{\pgfqpoint{0.679cm}{1.262cm}}
\pgfpathcurveto{\pgfqpoint{0.715cm}{1.262cm}}{\pgfqpoint{0.75cm}{1.276cm}}{\pgfqpoint{0.775cm}{1.302cm}}
\pgfpathcurveto{\pgfqpoint{0.801cm}{1.328cm}}{\pgfqpoint{0.815cm}{1.363cm}}{\pgfqpoint{0.815cm}{1.399cm}}
\pgfusepath{fill}
\pgfpathmoveto{\pgfqpoint{1.345cm}{1.371cm}}
\pgfpathcurveto{\pgfqpoint{1.345cm}{1.408cm}}{\pgfqpoint{1.331cm}{1.442cm}}{\pgfqpoint{1.305cm}{1.468cm}}
\pgfpathcurveto{\pgfqpoint{1.28cm}{1.494cm}}{\pgfqpoint{1.245cm}{1.508cm}}{\pgfqpoint{1.209cm}{1.508cm}}
\pgfpathcurveto{\pgfqpoint{1.172cm}{1.508cm}}{\pgfqpoint{1.138cm}{1.494cm}}{\pgfqpoint{1.112cm}{1.468cm}}
\pgfpathcurveto{\pgfqpoint{1.087cm}{1.442cm}}{\pgfqpoint{1.072cm}{1.408cm}}{\pgfqpoint{1.072cm}{1.371cm}}
\pgfpathcurveto{\pgfqpoint{1.072cm}{1.335cm}}{\pgfqpoint{1.087cm}{1.3cm}}{\pgfqpoint{1.112cm}{1.274cm}}
\pgfpathcurveto{\pgfqpoint{1.138cm}{1.249cm}}{\pgfqpoint{1.172cm}{1.234cm}}{\pgfqpoint{1.209cm}{1.234cm}}
\pgfpathcurveto{\pgfqpoint{1.245cm}{1.234cm}}{\pgfqpoint{1.28cm}{1.249cm}}{\pgfqpoint{1.305cm}{1.274cm}}
\pgfpathcurveto{\pgfqpoint{1.331cm}{1.3cm}}{\pgfqpoint{1.345cm}{1.335cm}}{\pgfqpoint{1.345cm}{1.371cm}}
\pgfusepath{fill}
\begin{pgfscope}
\pgfsetdash{}{0cm}
\pgfsetlinewidth{0.818mm}
\pgfsetroundcap
\pgfsetmiterlimit{4.0}
\pgfpathmoveto{\pgfqpoint{0.682cm}{0.671cm}}
\pgfpathlineto{\pgfqpoint{0.682cm}{0.042cm}}
\pgfusepath{stroke}
\end{pgfscope}
\end{pgfscope}
\end{pgfscope}
\end{pgfscope}
\end{tikzpicture}}}_{\varepsilon} = \llbracket X_{\varepsilon}^3 \rrbracket, \quad \Q
   X^{\!\resizebox{0.6em}{!}{
\begin{tikzpicture}
\pgfpathmoveto{\pgfqpoint{0cm}{0cm}}
\pgfpathlineto{\pgfqpoint{1.376cm}{0cm}}
\pgfpathlineto{\pgfqpoint{1.376cm}{1.588cm}}
\pgfpathlineto{\pgfqpoint{0cm}{1.588cm}}
\pgfpathclose
\pgfusepath{clip}
\begin{pgfscope}
\begin{pgfscope}
\pgfpathmoveto{\pgfqpoint{0cm}{0cm}}
\pgfpathlineto{\pgfqpoint{1.376cm}{0cm}}
\pgfpathlineto{\pgfqpoint{1.376cm}{1.588cm}}
\pgfpathlineto{\pgfqpoint{0cm}{1.588cm}}
\pgfpathclose
\pgfusepath{clip}
\begin{pgfscope}
\begin{pgfscope}
\definecolor{eps2pgf_color}{gray}{0.976471}\pgfsetstrokecolor{eps2pgf_color}\pgfsetfillcolor{eps2pgf_color}
\pgfpathmoveto{\pgfqpoint{0cm}{0cm}}
\pgfpathlineto{\pgfqpoint{1.376cm}{0cm}}
\pgfpathlineto{\pgfqpoint{1.376cm}{1.588cm}}
\pgfpathlineto{\pgfqpoint{0cm}{1.588cm}}
\pgfpathclose
\pgfusepath{fill}
\end{pgfscope}
\begin{pgfscope}
\pgfsetdash{}{0cm}
\pgfsetlinewidth{0.818mm}
\pgfsetroundcap
\pgfsetroundjoin
\pgfsetmiterlimit{7.0}
\definecolor{eps2pgf_color}{gray}{0}\pgfsetstrokecolor{eps2pgf_color}\pgfsetfillcolor{eps2pgf_color}
\pgfpathmoveto{\pgfqpoint{0.117cm}{1.476cm}}
\pgfpathlineto{\pgfqpoint{0.682cm}{0.726cm}}
\pgfpathlineto{\pgfqpoint{1.246cm}{1.476cm}}
\pgfusepath{stroke}
\end{pgfscope}
\definecolor{eps2pgf_color}{gray}{0}\pgfsetstrokecolor{eps2pgf_color}\pgfsetfillcolor{eps2pgf_color}
\pgfpathmoveto{\pgfqpoint{0.273cm}{1.451cm}}
\pgfpathcurveto{\pgfqpoint{0.273cm}{1.487cm}}{\pgfqpoint{0.259cm}{1.522cm}}{\pgfqpoint{0.233cm}{1.547cm}}
\pgfpathcurveto{\pgfqpoint{0.207cm}{1.573cm}}{\pgfqpoint{0.173cm}{1.588cm}}{\pgfqpoint{0.137cm}{1.588cm}}
\pgfpathcurveto{\pgfqpoint{0.1cm}{1.588cm}}{\pgfqpoint{0.066cm}{1.573cm}}{\pgfqpoint{0.04cm}{1.547cm}}
\pgfpathcurveto{\pgfqpoint{0.014cm}{1.522cm}}{\pgfqpoint{0cm}{1.487cm}}{\pgfqpoint{0cm}{1.451cm}}
\pgfpathcurveto{\pgfqpoint{0cm}{1.414cm}}{\pgfqpoint{0.014cm}{1.379cm}}{\pgfqpoint{0.04cm}{1.354cm}}
\pgfpathcurveto{\pgfqpoint{0.066cm}{1.328cm}}{\pgfqpoint{0.1cm}{1.314cm}}{\pgfqpoint{0.137cm}{1.314cm}}
\pgfpathcurveto{\pgfqpoint{0.173cm}{1.314cm}}{\pgfqpoint{0.207cm}{1.328cm}}{\pgfqpoint{0.233cm}{1.354cm}}
\pgfpathcurveto{\pgfqpoint{0.259cm}{1.379cm}}{\pgfqpoint{0.273cm}{1.414cm}}{\pgfqpoint{0.273cm}{1.451cm}}
\pgfusepath{fill}
\pgfpathmoveto{\pgfqpoint{1.345cm}{1.426cm}}
\pgfpathcurveto{\pgfqpoint{1.345cm}{1.463cm}}{\pgfqpoint{1.331cm}{1.497cm}}{\pgfqpoint{1.305cm}{1.523cm}}
\pgfpathcurveto{\pgfqpoint{1.28cm}{1.549cm}}{\pgfqpoint{1.245cm}{1.563cm}}{\pgfqpoint{1.209cm}{1.563cm}}
\pgfpathcurveto{\pgfqpoint{1.172cm}{1.563cm}}{\pgfqpoint{1.138cm}{1.549cm}}{\pgfqpoint{1.112cm}{1.523cm}}
\pgfpathcurveto{\pgfqpoint{1.087cm}{1.497cm}}{\pgfqpoint{1.072cm}{1.463cm}}{\pgfqpoint{1.072cm}{1.426cm}}
\pgfpathcurveto{\pgfqpoint{1.072cm}{1.39cm}}{\pgfqpoint{1.087cm}{1.355cm}}{\pgfqpoint{1.112cm}{1.329cm}}
\pgfpathcurveto{\pgfqpoint{1.138cm}{1.304cm}}{\pgfqpoint{1.172cm}{1.289cm}}{\pgfqpoint{1.209cm}{1.289cm}}
\pgfpathcurveto{\pgfqpoint{1.245cm}{1.289cm}}{\pgfqpoint{1.28cm}{1.304cm}}{\pgfqpoint{1.305cm}{1.329cm}}
\pgfpathcurveto{\pgfqpoint{1.331cm}{1.355cm}}{\pgfqpoint{1.345cm}{1.39cm}}{\pgfqpoint{1.345cm}{1.426cm}}
\pgfusepath{fill}
\begin{pgfscope}
\pgfsetdash{}{0cm}
\pgfsetlinewidth{0.818mm}
\pgfsetroundcap
\pgfsetmiterlimit{4.0}
\pgfpathmoveto{\pgfqpoint{0.682cm}{0.726cm}}
\pgfpathlineto{\pgfqpoint{0.682cm}{0.097cm}}
\pgfusepath{stroke}
\end{pgfscope}
\end{pgfscope}
\end{pgfscope}
\end{pgfscope}
\end{tikzpicture}}}_{\varepsilon} = \llbracket X_{\varepsilon}^2 \rrbracket, \]
\[X^{\!\resizebox{!}{.8em}{
\begin{tikzpicture}
\pgfpathmoveto{\pgfqpoint{0cm}{-0.035cm}}
\pgfpathlineto{\pgfqpoint{1.976cm}{-0.035cm}}
\pgfpathlineto{\pgfqpoint{1.976cm}{1.94cm}}
\pgfpathlineto{\pgfqpoint{0cm}{1.94cm}}
\pgfpathclose
\pgfusepath{clip}
\begin{pgfscope}
\begin{pgfscope}
\pgfpathmoveto{\pgfqpoint{0cm}{-0.035cm}}
\pgfpathlineto{\pgfqpoint{1.976cm}{-0.035cm}}
\pgfpathlineto{\pgfqpoint{1.976cm}{1.94cm}}
\pgfpathlineto{\pgfqpoint{0cm}{1.94cm}}
\pgfpathclose
\pgfusepath{clip}
\begin{pgfscope}
\begin{pgfscope}
\pgfsetdash{}{0cm}
\pgfsetlinewidth{0.818mm}
\pgfsetroundcap
\pgfsetroundjoin
\pgfsetmiterlimit{7.0}
\definecolor{eps2pgf_color}{gray}{0}\pgfsetstrokecolor{eps2pgf_color}\pgfsetfillcolor{eps2pgf_color}
\pgfpathmoveto{\pgfqpoint{0.117cm}{1.815cm}}
\pgfpathlineto{\pgfqpoint{0.682cm}{1.065cm}}
\pgfpathlineto{\pgfqpoint{1.246cm}{1.815cm}}
\pgfusepath{stroke}
\end{pgfscope}
\definecolor{eps2pgf_color}{gray}{0}\pgfsetstrokecolor{eps2pgf_color}\pgfsetfillcolor{eps2pgf_color}
\pgfpathmoveto{\pgfqpoint{0.273cm}{1.789cm}}
\pgfpathcurveto{\pgfqpoint{0.273cm}{1.825cm}}{\pgfqpoint{0.259cm}{1.86cm}}{\pgfqpoint{0.233cm}{1.886cm}}
\pgfpathcurveto{\pgfqpoint{0.207cm}{1.912cm}}{\pgfqpoint{0.173cm}{1.926cm}}{\pgfqpoint{0.137cm}{1.926cm}}
\pgfpathcurveto{\pgfqpoint{0.1cm}{1.926cm}}{\pgfqpoint{0.066cm}{1.912cm}}{\pgfqpoint{0.04cm}{1.886cm}}
\pgfpathcurveto{\pgfqpoint{0.014cm}{1.86cm}}{\pgfqpoint{0cm}{1.825cm}}{\pgfqpoint{0cm}{1.789cm}}
\pgfpathcurveto{\pgfqpoint{0cm}{1.753cm}}{\pgfqpoint{0.014cm}{1.718cm}}{\pgfqpoint{0.04cm}{1.692cm}}
\pgfpathcurveto{\pgfqpoint{0.066cm}{1.667cm}}{\pgfqpoint{0.1cm}{1.652cm}}{\pgfqpoint{0.137cm}{1.652cm}}
\pgfpathcurveto{\pgfqpoint{0.173cm}{1.652cm}}{\pgfqpoint{0.207cm}{1.667cm}}{\pgfqpoint{0.233cm}{1.692cm}}
\pgfpathcurveto{\pgfqpoint{0.259cm}{1.718cm}}{\pgfqpoint{0.273cm}{1.753cm}}{\pgfqpoint{0.273cm}{1.789cm}}
\pgfusepath{fill}
\begin{pgfscope}
\pgfsetdash{}{0cm}
\pgfsetlinewidth{0.818mm}
\pgfsetmiterlimit{7.0}
\pgfpathmoveto{\pgfqpoint{0.682cm}{1.065cm}}
\pgfpathlineto{\pgfqpoint{0.679cm}{1.812cm}}
\pgfusepath{stroke}
\end{pgfscope}
\pgfpathmoveto{\pgfqpoint{0.815cm}{1.793cm}}
\pgfpathcurveto{\pgfqpoint{0.815cm}{1.829cm}}{\pgfqpoint{0.801cm}{1.864cm}}{\pgfqpoint{0.775cm}{1.89cm}}
\pgfpathcurveto{\pgfqpoint{0.75cm}{1.915cm}}{\pgfqpoint{0.715cm}{1.93cm}}{\pgfqpoint{0.679cm}{1.93cm}}
\pgfpathcurveto{\pgfqpoint{0.643cm}{1.93cm}}{\pgfqpoint{0.608cm}{1.915cm}}{\pgfqpoint{0.582cm}{1.89cm}}
\pgfpathcurveto{\pgfqpoint{0.557cm}{1.864cm}}{\pgfqpoint{0.542cm}{1.829cm}}{\pgfqpoint{0.542cm}{1.793cm}}
\pgfpathcurveto{\pgfqpoint{0.542cm}{1.756cm}}{\pgfqpoint{0.557cm}{1.722cm}}{\pgfqpoint{0.582cm}{1.696cm}}
\pgfpathcurveto{\pgfqpoint{0.608cm}{1.67cm}}{\pgfqpoint{0.643cm}{1.656cm}}{\pgfqpoint{0.679cm}{1.656cm}}
\pgfpathcurveto{\pgfqpoint{0.715cm}{1.656cm}}{\pgfqpoint{0.75cm}{1.67cm}}{\pgfqpoint{0.775cm}{1.696cm}}
\pgfpathcurveto{\pgfqpoint{0.801cm}{1.722cm}}{\pgfqpoint{0.815cm}{1.756cm}}{\pgfqpoint{0.815cm}{1.793cm}}
\pgfusepath{fill}
\pgfpathmoveto{\pgfqpoint{1.345cm}{1.765cm}}
\pgfpathcurveto{\pgfqpoint{1.345cm}{1.801cm}}{\pgfqpoint{1.331cm}{1.836cm}}{\pgfqpoint{1.305cm}{1.862cm}}
\pgfpathcurveto{\pgfqpoint{1.28cm}{1.887cm}}{\pgfqpoint{1.245cm}{1.902cm}}{\pgfqpoint{1.209cm}{1.902cm}}
\pgfpathcurveto{\pgfqpoint{1.172cm}{1.902cm}}{\pgfqpoint{1.138cm}{1.887cm}}{\pgfqpoint{1.112cm}{1.862cm}}
\pgfpathcurveto{\pgfqpoint{1.087cm}{1.836cm}}{\pgfqpoint{1.072cm}{1.801cm}}{\pgfqpoint{1.072cm}{1.765cm}}
\pgfpathcurveto{\pgfqpoint{1.072cm}{1.728cm}}{\pgfqpoint{1.087cm}{1.694cm}}{\pgfqpoint{1.112cm}{1.668cm}}
\pgfpathcurveto{\pgfqpoint{1.138cm}{1.642cm}}{\pgfqpoint{1.172cm}{1.628cm}}{\pgfqpoint{1.209cm}{1.628cm}}
\pgfpathcurveto{\pgfqpoint{1.245cm}{1.628cm}}{\pgfqpoint{1.28cm}{1.642cm}}{\pgfqpoint{1.305cm}{1.668cm}}
\pgfpathcurveto{\pgfqpoint{1.331cm}{1.694cm}}{\pgfqpoint{1.345cm}{1.728cm}}{\pgfqpoint{1.345cm}{1.765cm}}
\pgfusepath{fill}
\begin{pgfscope}
\pgfsetdash{}{0cm}
\pgfsetlinewidth{0.818mm}
\pgfsetroundcap
\pgfsetroundjoin
\pgfsetmiterlimit{7.0}
\pgfpathmoveto{\pgfqpoint{0.682cm}{1.065cm}}
\pgfpathlineto{\pgfqpoint{1.246cm}{0.315cm}}
\pgfpathlineto{\pgfqpoint{1.811cm}{1.065cm}}
\pgfusepath{stroke}
\end{pgfscope}
\pgfpathmoveto{\pgfqpoint{1.948cm}{1.065cm}}
\pgfpathcurveto{\pgfqpoint{1.948cm}{1.101cm}}{\pgfqpoint{1.933cm}{1.136cm}}{\pgfqpoint{1.907cm}{1.162cm}}
\pgfpathcurveto{\pgfqpoint{1.882cm}{1.187cm}}{\pgfqpoint{1.847cm}{1.202cm}}{\pgfqpoint{1.811cm}{1.202cm}}
\pgfpathcurveto{\pgfqpoint{1.775cm}{1.202cm}}{\pgfqpoint{1.74cm}{1.187cm}}{\pgfqpoint{1.714cm}{1.162cm}}
\pgfpathcurveto{\pgfqpoint{1.689cm}{1.136cm}}{\pgfqpoint{1.674cm}{1.101cm}}{\pgfqpoint{1.674cm}{1.065cm}}
\pgfpathcurveto{\pgfqpoint{1.674cm}{1.029cm}}{\pgfqpoint{1.689cm}{0.994cm}}{\pgfqpoint{1.714cm}{0.968cm}}
\pgfpathcurveto{\pgfqpoint{1.74cm}{0.942cm}}{\pgfqpoint{1.775cm}{0.928cm}}{\pgfqpoint{1.811cm}{0.928cm}}
\pgfpathcurveto{\pgfqpoint{1.847cm}{0.928cm}}{\pgfqpoint{1.882cm}{0.942cm}}{\pgfqpoint{1.907cm}{0.968cm}}
\pgfpathcurveto{\pgfqpoint{1.933cm}{0.994cm}}{\pgfqpoint{1.948cm}{1.029cm}}{\pgfqpoint{1.948cm}{1.065cm}}
\pgfusepath{fill}
\begin{pgfscope}
\pgfsetdash{}{0cm}
\pgfsetlinewidth{0.818mm}
\pgfsetmiterlimit{4.0}
\pgfpathmoveto{\pgfqpoint{1.383cm}{0.178cm}}
\pgfpathcurveto{\pgfqpoint{1.383cm}{0.214cm}}{\pgfqpoint{1.369cm}{0.249cm}}{\pgfqpoint{1.343cm}{0.275cm}}
\pgfpathcurveto{\pgfqpoint{1.317cm}{0.3cm}}{\pgfqpoint{1.283cm}{0.315cm}}{\pgfqpoint{1.246cm}{0.315cm}}
\pgfpathcurveto{\pgfqpoint{1.21cm}{0.315cm}}{\pgfqpoint{1.175cm}{0.3cm}}{\pgfqpoint{1.15cm}{0.275cm}}
\pgfpathcurveto{\pgfqpoint{1.124cm}{0.249cm}}{\pgfqpoint{1.11cm}{0.214cm}}{\pgfqpoint{1.11cm}{0.178cm}}
\pgfpathcurveto{\pgfqpoint{1.11cm}{0.141cm}}{\pgfqpoint{1.124cm}{0.107cm}}{\pgfqpoint{1.15cm}{0.081cm}}
\pgfpathcurveto{\pgfqpoint{1.175cm}{0.055cm}}{\pgfqpoint{1.21cm}{0.041cm}}{\pgfqpoint{1.246cm}{0.041cm}}
\pgfpathcurveto{\pgfqpoint{1.283cm}{0.041cm}}{\pgfqpoint{1.317cm}{0.055cm}}{\pgfqpoint{1.343cm}{0.081cm}}
\pgfpathcurveto{\pgfqpoint{1.369cm}{0.107cm}}{\pgfqpoint{1.383cm}{0.141cm}}{\pgfqpoint{1.383cm}{0.178cm}}
\pgfusepath{stroke}
\end{pgfscope}
\end{pgfscope}
\end{pgfscope}
\end{pgfscope}
\end{tikzpicture}}}=\lim_{\varepsilon\to 0}X^{\!\resizebox{0.6em}{!}{
\begin{tikzpicture}
\pgfpathmoveto{\pgfqpoint{0cm}{-0.035cm}}
\pgfpathlineto{\pgfqpoint{1.376cm}{-0.035cm}}
\pgfpathlineto{\pgfqpoint{1.376cm}{1.552cm}}
\pgfpathlineto{\pgfqpoint{0cm}{1.552cm}}
\pgfpathclose
\pgfusepath{clip}
\begin{pgfscope}
\begin{pgfscope}
\pgfpathmoveto{\pgfqpoint{0cm}{-0.035cm}}
\pgfpathlineto{\pgfqpoint{1.376cm}{-0.035cm}}
\pgfpathlineto{\pgfqpoint{1.376cm}{1.552cm}}
\pgfpathlineto{\pgfqpoint{0cm}{1.552cm}}
\pgfpathclose
\pgfusepath{clip}
\begin{pgfscope}
\begin{pgfscope}
\pgfsetdash{}{0cm}
\pgfsetlinewidth{0.818mm}
\pgfsetroundcap
\pgfsetroundjoin
\pgfsetmiterlimit{7.0}
\definecolor{eps2pgf_color}{gray}{0}\pgfsetstrokecolor{eps2pgf_color}\pgfsetfillcolor{eps2pgf_color}
\pgfpathmoveto{\pgfqpoint{0.117cm}{1.421cm}}
\pgfpathlineto{\pgfqpoint{0.682cm}{0.671cm}}
\pgfpathlineto{\pgfqpoint{1.246cm}{1.421cm}}
\pgfusepath{stroke}
\end{pgfscope}
\definecolor{eps2pgf_color}{gray}{0}\pgfsetstrokecolor{eps2pgf_color}\pgfsetfillcolor{eps2pgf_color}
\pgfpathmoveto{\pgfqpoint{0.273cm}{1.395cm}}
\pgfpathcurveto{\pgfqpoint{0.273cm}{1.432cm}}{\pgfqpoint{0.259cm}{1.467cm}}{\pgfqpoint{0.233cm}{1.492cm}}
\pgfpathcurveto{\pgfqpoint{0.207cm}{1.518cm}}{\pgfqpoint{0.173cm}{1.532cm}}{\pgfqpoint{0.137cm}{1.532cm}}
\pgfpathcurveto{\pgfqpoint{0.1cm}{1.532cm}}{\pgfqpoint{0.066cm}{1.518cm}}{\pgfqpoint{0.04cm}{1.492cm}}
\pgfpathcurveto{\pgfqpoint{0.014cm}{1.467cm}}{\pgfqpoint{0cm}{1.432cm}}{\pgfqpoint{0cm}{1.395cm}}
\pgfpathcurveto{\pgfqpoint{0cm}{1.359cm}}{\pgfqpoint{0.014cm}{1.324cm}}{\pgfqpoint{0.04cm}{1.299cm}}
\pgfpathcurveto{\pgfqpoint{0.066cm}{1.273cm}}{\pgfqpoint{0.1cm}{1.258cm}}{\pgfqpoint{0.137cm}{1.258cm}}
\pgfpathcurveto{\pgfqpoint{0.173cm}{1.258cm}}{\pgfqpoint{0.207cm}{1.273cm}}{\pgfqpoint{0.233cm}{1.299cm}}
\pgfpathcurveto{\pgfqpoint{0.259cm}{1.324cm}}{\pgfqpoint{0.273cm}{1.359cm}}{\pgfqpoint{0.273cm}{1.395cm}}
\pgfusepath{fill}
\begin{pgfscope}
\pgfsetdash{}{0cm}
\pgfsetlinewidth{0.818mm}
\pgfsetmiterlimit{7.0}
\pgfpathmoveto{\pgfqpoint{0.682cm}{0.671cm}}
\pgfpathlineto{\pgfqpoint{0.679cm}{1.418cm}}
\pgfusepath{stroke}
\end{pgfscope}
\pgfpathmoveto{\pgfqpoint{0.815cm}{1.399cm}}
\pgfpathcurveto{\pgfqpoint{0.815cm}{1.435cm}}{\pgfqpoint{0.801cm}{1.47cm}}{\pgfqpoint{0.775cm}{1.496cm}}
\pgfpathcurveto{\pgfqpoint{0.75cm}{1.521cm}}{\pgfqpoint{0.715cm}{1.536cm}}{\pgfqpoint{0.679cm}{1.536cm}}
\pgfpathcurveto{\pgfqpoint{0.643cm}{1.536cm}}{\pgfqpoint{0.608cm}{1.521cm}}{\pgfqpoint{0.582cm}{1.496cm}}
\pgfpathcurveto{\pgfqpoint{0.557cm}{1.47cm}}{\pgfqpoint{0.542cm}{1.435cm}}{\pgfqpoint{0.542cm}{1.399cm}}
\pgfpathcurveto{\pgfqpoint{0.542cm}{1.363cm}}{\pgfqpoint{0.557cm}{1.328cm}}{\pgfqpoint{0.582cm}{1.302cm}}
\pgfpathcurveto{\pgfqpoint{0.608cm}{1.276cm}}{\pgfqpoint{0.643cm}{1.262cm}}{\pgfqpoint{0.679cm}{1.262cm}}
\pgfpathcurveto{\pgfqpoint{0.715cm}{1.262cm}}{\pgfqpoint{0.75cm}{1.276cm}}{\pgfqpoint{0.775cm}{1.302cm}}
\pgfpathcurveto{\pgfqpoint{0.801cm}{1.328cm}}{\pgfqpoint{0.815cm}{1.363cm}}{\pgfqpoint{0.815cm}{1.399cm}}
\pgfusepath{fill}
\pgfpathmoveto{\pgfqpoint{1.345cm}{1.371cm}}
\pgfpathcurveto{\pgfqpoint{1.345cm}{1.408cm}}{\pgfqpoint{1.331cm}{1.442cm}}{\pgfqpoint{1.305cm}{1.468cm}}
\pgfpathcurveto{\pgfqpoint{1.28cm}{1.494cm}}{\pgfqpoint{1.245cm}{1.508cm}}{\pgfqpoint{1.209cm}{1.508cm}}
\pgfpathcurveto{\pgfqpoint{1.172cm}{1.508cm}}{\pgfqpoint{1.138cm}{1.494cm}}{\pgfqpoint{1.112cm}{1.468cm}}
\pgfpathcurveto{\pgfqpoint{1.087cm}{1.442cm}}{\pgfqpoint{1.072cm}{1.408cm}}{\pgfqpoint{1.072cm}{1.371cm}}
\pgfpathcurveto{\pgfqpoint{1.072cm}{1.335cm}}{\pgfqpoint{1.087cm}{1.3cm}}{\pgfqpoint{1.112cm}{1.274cm}}
\pgfpathcurveto{\pgfqpoint{1.138cm}{1.249cm}}{\pgfqpoint{1.172cm}{1.234cm}}{\pgfqpoint{1.209cm}{1.234cm}}
\pgfpathcurveto{\pgfqpoint{1.245cm}{1.234cm}}{\pgfqpoint{1.28cm}{1.249cm}}{\pgfqpoint{1.305cm}{1.274cm}}
\pgfpathcurveto{\pgfqpoint{1.331cm}{1.3cm}}{\pgfqpoint{1.345cm}{1.335cm}}{\pgfqpoint{1.345cm}{1.371cm}}
\pgfusepath{fill}
\begin{pgfscope}
\pgfsetdash{}{0cm}
\pgfsetlinewidth{0.818mm}
\pgfsetroundcap
\pgfsetmiterlimit{4.0}
\pgfpathmoveto{\pgfqpoint{0.682cm}{0.671cm}}
\pgfpathlineto{\pgfqpoint{0.682cm}{0.042cm}}
\pgfusepath{stroke}
\end{pgfscope}
\end{pgfscope}
\end{pgfscope}
\end{pgfscope}
\end{tikzpicture}}}_{\varepsilon}\circ X_{\varepsilon},\qquad X^{\!\resizebox{!}{.8em}{
\begin{tikzpicture}
\pgfpathmoveto{\pgfqpoint{0cm}{-0.035cm}}
\pgfpathlineto{\pgfqpoint{1.976cm}{-0.035cm}}
\pgfpathlineto{\pgfqpoint{1.976cm}{1.94cm}}
\pgfpathlineto{\pgfqpoint{0cm}{1.94cm}}
\pgfpathclose
\pgfusepath{clip}
\begin{pgfscope}
\begin{pgfscope}
\pgfpathmoveto{\pgfqpoint{0cm}{-0.035cm}}
\pgfpathlineto{\pgfqpoint{1.976cm}{-0.035cm}}
\pgfpathlineto{\pgfqpoint{1.976cm}{1.94cm}}
\pgfpathlineto{\pgfqpoint{0cm}{1.94cm}}
\pgfpathclose
\pgfusepath{clip}
\begin{pgfscope}
\begin{pgfscope}
\pgfsetdash{}{0cm}
\pgfsetlinewidth{0.818mm}
\pgfsetroundcap
\pgfsetroundjoin
\pgfsetmiterlimit{7.0}
\definecolor{eps2pgf_color}{gray}{0}\pgfsetstrokecolor{eps2pgf_color}\pgfsetfillcolor{eps2pgf_color}
\pgfpathmoveto{\pgfqpoint{0.117cm}{1.815cm}}
\pgfpathlineto{\pgfqpoint{0.682cm}{1.065cm}}
\pgfpathlineto{\pgfqpoint{1.246cm}{1.815cm}}
\pgfusepath{stroke}
\end{pgfscope}
\definecolor{eps2pgf_color}{gray}{0}\pgfsetstrokecolor{eps2pgf_color}\pgfsetfillcolor{eps2pgf_color}
\pgfpathmoveto{\pgfqpoint{0.273cm}{1.789cm}}
\pgfpathcurveto{\pgfqpoint{0.273cm}{1.825cm}}{\pgfqpoint{0.259cm}{1.86cm}}{\pgfqpoint{0.233cm}{1.886cm}}
\pgfpathcurveto{\pgfqpoint{0.207cm}{1.912cm}}{\pgfqpoint{0.173cm}{1.926cm}}{\pgfqpoint{0.137cm}{1.926cm}}
\pgfpathcurveto{\pgfqpoint{0.1cm}{1.926cm}}{\pgfqpoint{0.066cm}{1.912cm}}{\pgfqpoint{0.04cm}{1.886cm}}
\pgfpathcurveto{\pgfqpoint{0.014cm}{1.86cm}}{\pgfqpoint{0cm}{1.825cm}}{\pgfqpoint{0cm}{1.789cm}}
\pgfpathcurveto{\pgfqpoint{0cm}{1.753cm}}{\pgfqpoint{0.014cm}{1.718cm}}{\pgfqpoint{0.04cm}{1.692cm}}
\pgfpathcurveto{\pgfqpoint{0.066cm}{1.667cm}}{\pgfqpoint{0.1cm}{1.652cm}}{\pgfqpoint{0.137cm}{1.652cm}}
\pgfpathcurveto{\pgfqpoint{0.173cm}{1.652cm}}{\pgfqpoint{0.207cm}{1.667cm}}{\pgfqpoint{0.233cm}{1.692cm}}
\pgfpathcurveto{\pgfqpoint{0.259cm}{1.718cm}}{\pgfqpoint{0.273cm}{1.753cm}}{\pgfqpoint{0.273cm}{1.789cm}}
\pgfusepath{fill}
\pgfpathmoveto{\pgfqpoint{1.345cm}{1.765cm}}
\pgfpathcurveto{\pgfqpoint{1.345cm}{1.801cm}}{\pgfqpoint{1.331cm}{1.836cm}}{\pgfqpoint{1.305cm}{1.862cm}}
\pgfpathcurveto{\pgfqpoint{1.28cm}{1.887cm}}{\pgfqpoint{1.245cm}{1.902cm}}{\pgfqpoint{1.209cm}{1.902cm}}
\pgfpathcurveto{\pgfqpoint{1.172cm}{1.902cm}}{\pgfqpoint{1.138cm}{1.887cm}}{\pgfqpoint{1.112cm}{1.862cm}}
\pgfpathcurveto{\pgfqpoint{1.087cm}{1.836cm}}{\pgfqpoint{1.072cm}{1.801cm}}{\pgfqpoint{1.072cm}{1.765cm}}
\pgfpathcurveto{\pgfqpoint{1.072cm}{1.728cm}}{\pgfqpoint{1.087cm}{1.694cm}}{\pgfqpoint{1.112cm}{1.668cm}}
\pgfpathcurveto{\pgfqpoint{1.138cm}{1.642cm}}{\pgfqpoint{1.172cm}{1.628cm}}{\pgfqpoint{1.209cm}{1.628cm}}
\pgfpathcurveto{\pgfqpoint{1.245cm}{1.628cm}}{\pgfqpoint{1.28cm}{1.642cm}}{\pgfqpoint{1.305cm}{1.668cm}}
\pgfpathcurveto{\pgfqpoint{1.331cm}{1.694cm}}{\pgfqpoint{1.345cm}{1.728cm}}{\pgfqpoint{1.345cm}{1.765cm}}
\pgfusepath{fill}
\begin{pgfscope}
\pgfsetdash{}{0cm}
\pgfsetlinewidth{0.818mm}
\pgfsetroundcap
\pgfsetroundjoin
\pgfsetmiterlimit{7.0}
\pgfpathmoveto{\pgfqpoint{0.682cm}{1.065cm}}
\pgfpathlineto{\pgfqpoint{1.246cm}{0.315cm}}
\pgfpathlineto{\pgfqpoint{1.811cm}{1.065cm}}
\pgfusepath{stroke}
\end{pgfscope}
\pgfpathmoveto{\pgfqpoint{1.948cm}{1.065cm}}
\pgfpathcurveto{\pgfqpoint{1.948cm}{1.101cm}}{\pgfqpoint{1.933cm}{1.136cm}}{\pgfqpoint{1.907cm}{1.162cm}}
\pgfpathcurveto{\pgfqpoint{1.882cm}{1.187cm}}{\pgfqpoint{1.847cm}{1.202cm}}{\pgfqpoint{1.811cm}{1.202cm}}
\pgfpathcurveto{\pgfqpoint{1.775cm}{1.202cm}}{\pgfqpoint{1.74cm}{1.187cm}}{\pgfqpoint{1.714cm}{1.162cm}}
\pgfpathcurveto{\pgfqpoint{1.689cm}{1.136cm}}{\pgfqpoint{1.674cm}{1.101cm}}{\pgfqpoint{1.674cm}{1.065cm}}
\pgfpathcurveto{\pgfqpoint{1.674cm}{1.029cm}}{\pgfqpoint{1.689cm}{0.994cm}}{\pgfqpoint{1.714cm}{0.968cm}}
\pgfpathcurveto{\pgfqpoint{1.74cm}{0.942cm}}{\pgfqpoint{1.775cm}{0.928cm}}{\pgfqpoint{1.811cm}{0.928cm}}
\pgfpathcurveto{\pgfqpoint{1.847cm}{0.928cm}}{\pgfqpoint{1.882cm}{0.942cm}}{\pgfqpoint{1.907cm}{0.968cm}}
\pgfpathcurveto{\pgfqpoint{1.933cm}{0.994cm}}{\pgfqpoint{1.948cm}{1.029cm}}{\pgfqpoint{1.948cm}{1.065cm}}
\pgfusepath{fill}
\begin{pgfscope}
\pgfsetdash{}{0cm}
\pgfsetlinewidth{0.818mm}
\pgfsetmiterlimit{7.0}
\pgfpathmoveto{\pgfqpoint{1.246cm}{0.315cm}}
\pgfpathlineto{\pgfqpoint{1.244cm}{1.061cm}}
\pgfusepath{stroke}
\end{pgfscope}
\pgfpathmoveto{\pgfqpoint{1.38cm}{1.065cm}}
\pgfpathcurveto{\pgfqpoint{1.38cm}{1.101cm}}{\pgfqpoint{1.366cm}{1.136cm}}{\pgfqpoint{1.34cm}{1.162cm}}
\pgfpathcurveto{\pgfqpoint{1.315cm}{1.187cm}}{\pgfqpoint{1.28cm}{1.202cm}}{\pgfqpoint{1.244cm}{1.202cm}}
\pgfpathcurveto{\pgfqpoint{1.207cm}{1.202cm}}{\pgfqpoint{1.173cm}{1.187cm}}{\pgfqpoint{1.147cm}{1.162cm}}
\pgfpathcurveto{\pgfqpoint{1.121cm}{1.136cm}}{\pgfqpoint{1.107cm}{1.101cm}}{\pgfqpoint{1.107cm}{1.065cm}}
\pgfpathcurveto{\pgfqpoint{1.107cm}{1.029cm}}{\pgfqpoint{1.121cm}{0.994cm}}{\pgfqpoint{1.147cm}{0.968cm}}
\pgfpathcurveto{\pgfqpoint{1.173cm}{0.942cm}}{\pgfqpoint{1.207cm}{0.928cm}}{\pgfqpoint{1.244cm}{0.928cm}}
\pgfpathcurveto{\pgfqpoint{1.28cm}{0.928cm}}{\pgfqpoint{1.315cm}{0.942cm}}{\pgfqpoint{1.34cm}{0.968cm}}
\pgfpathcurveto{\pgfqpoint{1.366cm}{0.994cm}}{\pgfqpoint{1.38cm}{1.029cm}}{\pgfqpoint{1.38cm}{1.065cm}}
\pgfusepath{fill}
\begin{pgfscope}
\pgfsetdash{}{0cm}
\pgfsetlinewidth{0.818mm}
\pgfsetmiterlimit{4.0}
\pgfpathmoveto{\pgfqpoint{1.383cm}{0.178cm}}
\pgfpathcurveto{\pgfqpoint{1.383cm}{0.214cm}}{\pgfqpoint{1.369cm}{0.249cm}}{\pgfqpoint{1.343cm}{0.275cm}}
\pgfpathcurveto{\pgfqpoint{1.317cm}{0.3cm}}{\pgfqpoint{1.283cm}{0.315cm}}{\pgfqpoint{1.246cm}{0.315cm}}
\pgfpathcurveto{\pgfqpoint{1.21cm}{0.315cm}}{\pgfqpoint{1.175cm}{0.3cm}}{\pgfqpoint{1.15cm}{0.275cm}}
\pgfpathcurveto{\pgfqpoint{1.124cm}{0.249cm}}{\pgfqpoint{1.11cm}{0.214cm}}{\pgfqpoint{1.11cm}{0.178cm}}
\pgfpathcurveto{\pgfqpoint{1.11cm}{0.141cm}}{\pgfqpoint{1.124cm}{0.107cm}}{\pgfqpoint{1.15cm}{0.081cm}}
\pgfpathcurveto{\pgfqpoint{1.175cm}{0.055cm}}{\pgfqpoint{1.21cm}{0.041cm}}{\pgfqpoint{1.246cm}{0.041cm}}
\pgfpathcurveto{\pgfqpoint{1.283cm}{0.041cm}}{\pgfqpoint{1.317cm}{0.055cm}}{\pgfqpoint{1.343cm}{0.081cm}}
\pgfpathcurveto{\pgfqpoint{1.369cm}{0.107cm}}{\pgfqpoint{1.383cm}{0.141cm}}{\pgfqpoint{1.383cm}{0.178cm}}
\pgfusepath{stroke}
\end{pgfscope}
\end{pgfscope}
\end{pgfscope}
\end{pgfscope}
\end{tikzpicture}}} = \lim_{\varepsilon\to 0}X^{\!\resizebox{0.6em}{!}{
\begin{tikzpicture}
\pgfpathmoveto{\pgfqpoint{0cm}{0cm}}
\pgfpathlineto{\pgfqpoint{1.376cm}{0cm}}
\pgfpathlineto{\pgfqpoint{1.376cm}{1.588cm}}
\pgfpathlineto{\pgfqpoint{0cm}{1.588cm}}
\pgfpathclose
\pgfusepath{clip}
\begin{pgfscope}
\begin{pgfscope}
\pgfpathmoveto{\pgfqpoint{0cm}{0cm}}
\pgfpathlineto{\pgfqpoint{1.376cm}{0cm}}
\pgfpathlineto{\pgfqpoint{1.376cm}{1.588cm}}
\pgfpathlineto{\pgfqpoint{0cm}{1.588cm}}
\pgfpathclose
\pgfusepath{clip}
\begin{pgfscope}
\begin{pgfscope}
\definecolor{eps2pgf_color}{gray}{0.976471}\pgfsetstrokecolor{eps2pgf_color}\pgfsetfillcolor{eps2pgf_color}
\pgfpathmoveto{\pgfqpoint{0cm}{0cm}}
\pgfpathlineto{\pgfqpoint{1.376cm}{0cm}}
\pgfpathlineto{\pgfqpoint{1.376cm}{1.588cm}}
\pgfpathlineto{\pgfqpoint{0cm}{1.588cm}}
\pgfpathclose
\pgfusepath{fill}
\end{pgfscope}
\begin{pgfscope}
\pgfsetdash{}{0cm}
\pgfsetlinewidth{0.818mm}
\pgfsetroundcap
\pgfsetroundjoin
\pgfsetmiterlimit{7.0}
\definecolor{eps2pgf_color}{gray}{0}\pgfsetstrokecolor{eps2pgf_color}\pgfsetfillcolor{eps2pgf_color}
\pgfpathmoveto{\pgfqpoint{0.117cm}{1.476cm}}
\pgfpathlineto{\pgfqpoint{0.682cm}{0.726cm}}
\pgfpathlineto{\pgfqpoint{1.246cm}{1.476cm}}
\pgfusepath{stroke}
\end{pgfscope}
\definecolor{eps2pgf_color}{gray}{0}\pgfsetstrokecolor{eps2pgf_color}\pgfsetfillcolor{eps2pgf_color}
\pgfpathmoveto{\pgfqpoint{0.273cm}{1.451cm}}
\pgfpathcurveto{\pgfqpoint{0.273cm}{1.487cm}}{\pgfqpoint{0.259cm}{1.522cm}}{\pgfqpoint{0.233cm}{1.547cm}}
\pgfpathcurveto{\pgfqpoint{0.207cm}{1.573cm}}{\pgfqpoint{0.173cm}{1.588cm}}{\pgfqpoint{0.137cm}{1.588cm}}
\pgfpathcurveto{\pgfqpoint{0.1cm}{1.588cm}}{\pgfqpoint{0.066cm}{1.573cm}}{\pgfqpoint{0.04cm}{1.547cm}}
\pgfpathcurveto{\pgfqpoint{0.014cm}{1.522cm}}{\pgfqpoint{0cm}{1.487cm}}{\pgfqpoint{0cm}{1.451cm}}
\pgfpathcurveto{\pgfqpoint{0cm}{1.414cm}}{\pgfqpoint{0.014cm}{1.379cm}}{\pgfqpoint{0.04cm}{1.354cm}}
\pgfpathcurveto{\pgfqpoint{0.066cm}{1.328cm}}{\pgfqpoint{0.1cm}{1.314cm}}{\pgfqpoint{0.137cm}{1.314cm}}
\pgfpathcurveto{\pgfqpoint{0.173cm}{1.314cm}}{\pgfqpoint{0.207cm}{1.328cm}}{\pgfqpoint{0.233cm}{1.354cm}}
\pgfpathcurveto{\pgfqpoint{0.259cm}{1.379cm}}{\pgfqpoint{0.273cm}{1.414cm}}{\pgfqpoint{0.273cm}{1.451cm}}
\pgfusepath{fill}
\pgfpathmoveto{\pgfqpoint{1.345cm}{1.426cm}}
\pgfpathcurveto{\pgfqpoint{1.345cm}{1.463cm}}{\pgfqpoint{1.331cm}{1.497cm}}{\pgfqpoint{1.305cm}{1.523cm}}
\pgfpathcurveto{\pgfqpoint{1.28cm}{1.549cm}}{\pgfqpoint{1.245cm}{1.563cm}}{\pgfqpoint{1.209cm}{1.563cm}}
\pgfpathcurveto{\pgfqpoint{1.172cm}{1.563cm}}{\pgfqpoint{1.138cm}{1.549cm}}{\pgfqpoint{1.112cm}{1.523cm}}
\pgfpathcurveto{\pgfqpoint{1.087cm}{1.497cm}}{\pgfqpoint{1.072cm}{1.463cm}}{\pgfqpoint{1.072cm}{1.426cm}}
\pgfpathcurveto{\pgfqpoint{1.072cm}{1.39cm}}{\pgfqpoint{1.087cm}{1.355cm}}{\pgfqpoint{1.112cm}{1.329cm}}
\pgfpathcurveto{\pgfqpoint{1.138cm}{1.304cm}}{\pgfqpoint{1.172cm}{1.289cm}}{\pgfqpoint{1.209cm}{1.289cm}}
\pgfpathcurveto{\pgfqpoint{1.245cm}{1.289cm}}{\pgfqpoint{1.28cm}{1.304cm}}{\pgfqpoint{1.305cm}{1.329cm}}
\pgfpathcurveto{\pgfqpoint{1.331cm}{1.355cm}}{\pgfqpoint{1.345cm}{1.39cm}}{\pgfqpoint{1.345cm}{1.426cm}}
\pgfusepath{fill}
\begin{pgfscope}
\pgfsetdash{}{0cm}
\pgfsetlinewidth{0.818mm}
\pgfsetroundcap
\pgfsetmiterlimit{4.0}
\pgfpathmoveto{\pgfqpoint{0.682cm}{0.726cm}}
\pgfpathlineto{\pgfqpoint{0.682cm}{0.097cm}}
\pgfusepath{stroke}
\end{pgfscope}
\end{pgfscope}
\end{pgfscope}
\end{pgfscope}
\end{tikzpicture}}}_{\varepsilon} \circ \llbracket X_{\varepsilon}^2 \rrbracket -\frac{b_{\varepsilon}}{3}, \qquad
   X^{\!\resizebox{!}{.8em}{
\begin{tikzpicture}
\pgfpathmoveto{\pgfqpoint{0cm}{-0.035cm}}
\pgfpathlineto{\pgfqpoint{1.976cm}{-0.035cm}}
\pgfpathlineto{\pgfqpoint{1.976cm}{1.94cm}}
\pgfpathlineto{\pgfqpoint{0cm}{1.94cm}}
\pgfpathclose
\pgfusepath{clip}
\begin{pgfscope}
\begin{pgfscope}
\pgfpathmoveto{\pgfqpoint{0cm}{-0.035cm}}
\pgfpathlineto{\pgfqpoint{1.976cm}{-0.035cm}}
\pgfpathlineto{\pgfqpoint{1.976cm}{1.94cm}}
\pgfpathlineto{\pgfqpoint{0cm}{1.94cm}}
\pgfpathclose
\pgfusepath{clip}
\begin{pgfscope}
\begin{pgfscope}
\pgfsetdash{}{0cm}
\pgfsetlinewidth{0.818mm}
\pgfsetroundcap
\pgfsetroundjoin
\pgfsetmiterlimit{7.0}
\definecolor{eps2pgf_color}{gray}{0}\pgfsetstrokecolor{eps2pgf_color}\pgfsetfillcolor{eps2pgf_color}
\pgfpathmoveto{\pgfqpoint{0.117cm}{1.815cm}}
\pgfpathlineto{\pgfqpoint{0.682cm}{1.065cm}}
\pgfpathlineto{\pgfqpoint{1.246cm}{1.815cm}}
\pgfusepath{stroke}
\end{pgfscope}
\definecolor{eps2pgf_color}{gray}{0}\pgfsetstrokecolor{eps2pgf_color}\pgfsetfillcolor{eps2pgf_color}
\pgfpathmoveto{\pgfqpoint{0.273cm}{1.789cm}}
\pgfpathcurveto{\pgfqpoint{0.273cm}{1.825cm}}{\pgfqpoint{0.259cm}{1.86cm}}{\pgfqpoint{0.233cm}{1.886cm}}
\pgfpathcurveto{\pgfqpoint{0.207cm}{1.912cm}}{\pgfqpoint{0.173cm}{1.926cm}}{\pgfqpoint{0.137cm}{1.926cm}}
\pgfpathcurveto{\pgfqpoint{0.1cm}{1.926cm}}{\pgfqpoint{0.066cm}{1.912cm}}{\pgfqpoint{0.04cm}{1.886cm}}
\pgfpathcurveto{\pgfqpoint{0.014cm}{1.86cm}}{\pgfqpoint{0cm}{1.825cm}}{\pgfqpoint{0cm}{1.789cm}}
\pgfpathcurveto{\pgfqpoint{0cm}{1.753cm}}{\pgfqpoint{0.014cm}{1.718cm}}{\pgfqpoint{0.04cm}{1.692cm}}
\pgfpathcurveto{\pgfqpoint{0.066cm}{1.667cm}}{\pgfqpoint{0.1cm}{1.652cm}}{\pgfqpoint{0.137cm}{1.652cm}}
\pgfpathcurveto{\pgfqpoint{0.173cm}{1.652cm}}{\pgfqpoint{0.207cm}{1.667cm}}{\pgfqpoint{0.233cm}{1.692cm}}
\pgfpathcurveto{\pgfqpoint{0.259cm}{1.718cm}}{\pgfqpoint{0.273cm}{1.753cm}}{\pgfqpoint{0.273cm}{1.789cm}}
\pgfusepath{fill}
\begin{pgfscope}
\pgfsetdash{}{0cm}
\pgfsetlinewidth{0.818mm}
\pgfsetmiterlimit{7.0}
\pgfpathmoveto{\pgfqpoint{0.682cm}{1.065cm}}
\pgfpathlineto{\pgfqpoint{0.679cm}{1.812cm}}
\pgfusepath{stroke}
\end{pgfscope}
\pgfpathmoveto{\pgfqpoint{0.815cm}{1.793cm}}
\pgfpathcurveto{\pgfqpoint{0.815cm}{1.829cm}}{\pgfqpoint{0.801cm}{1.864cm}}{\pgfqpoint{0.775cm}{1.89cm}}
\pgfpathcurveto{\pgfqpoint{0.75cm}{1.915cm}}{\pgfqpoint{0.715cm}{1.93cm}}{\pgfqpoint{0.679cm}{1.93cm}}
\pgfpathcurveto{\pgfqpoint{0.643cm}{1.93cm}}{\pgfqpoint{0.608cm}{1.915cm}}{\pgfqpoint{0.582cm}{1.89cm}}
\pgfpathcurveto{\pgfqpoint{0.557cm}{1.864cm}}{\pgfqpoint{0.542cm}{1.829cm}}{\pgfqpoint{0.542cm}{1.793cm}}
\pgfpathcurveto{\pgfqpoint{0.542cm}{1.756cm}}{\pgfqpoint{0.557cm}{1.722cm}}{\pgfqpoint{0.582cm}{1.696cm}}
\pgfpathcurveto{\pgfqpoint{0.608cm}{1.67cm}}{\pgfqpoint{0.643cm}{1.656cm}}{\pgfqpoint{0.679cm}{1.656cm}}
\pgfpathcurveto{\pgfqpoint{0.715cm}{1.656cm}}{\pgfqpoint{0.75cm}{1.67cm}}{\pgfqpoint{0.775cm}{1.696cm}}
\pgfpathcurveto{\pgfqpoint{0.801cm}{1.722cm}}{\pgfqpoint{0.815cm}{1.756cm}}{\pgfqpoint{0.815cm}{1.793cm}}
\pgfusepath{fill}
\pgfpathmoveto{\pgfqpoint{1.345cm}{1.765cm}}
\pgfpathcurveto{\pgfqpoint{1.345cm}{1.801cm}}{\pgfqpoint{1.331cm}{1.836cm}}{\pgfqpoint{1.305cm}{1.862cm}}
\pgfpathcurveto{\pgfqpoint{1.28cm}{1.887cm}}{\pgfqpoint{1.245cm}{1.902cm}}{\pgfqpoint{1.209cm}{1.902cm}}
\pgfpathcurveto{\pgfqpoint{1.172cm}{1.902cm}}{\pgfqpoint{1.138cm}{1.887cm}}{\pgfqpoint{1.112cm}{1.862cm}}
\pgfpathcurveto{\pgfqpoint{1.087cm}{1.836cm}}{\pgfqpoint{1.072cm}{1.801cm}}{\pgfqpoint{1.072cm}{1.765cm}}
\pgfpathcurveto{\pgfqpoint{1.072cm}{1.728cm}}{\pgfqpoint{1.087cm}{1.694cm}}{\pgfqpoint{1.112cm}{1.668cm}}
\pgfpathcurveto{\pgfqpoint{1.138cm}{1.642cm}}{\pgfqpoint{1.172cm}{1.628cm}}{\pgfqpoint{1.209cm}{1.628cm}}
\pgfpathcurveto{\pgfqpoint{1.245cm}{1.628cm}}{\pgfqpoint{1.28cm}{1.642cm}}{\pgfqpoint{1.305cm}{1.668cm}}
\pgfpathcurveto{\pgfqpoint{1.331cm}{1.694cm}}{\pgfqpoint{1.345cm}{1.728cm}}{\pgfqpoint{1.345cm}{1.765cm}}
\pgfusepath{fill}
\begin{pgfscope}
\pgfsetdash{}{0cm}
\pgfsetlinewidth{0.818mm}
\pgfsetroundcap
\pgfsetroundjoin
\pgfsetmiterlimit{7.0}
\pgfpathmoveto{\pgfqpoint{0.682cm}{1.065cm}}
\pgfpathlineto{\pgfqpoint{1.246cm}{0.315cm}}
\pgfpathlineto{\pgfqpoint{1.811cm}{1.065cm}}
\pgfusepath{stroke}
\end{pgfscope}
\pgfpathmoveto{\pgfqpoint{1.948cm}{1.065cm}}
\pgfpathcurveto{\pgfqpoint{1.948cm}{1.101cm}}{\pgfqpoint{1.933cm}{1.136cm}}{\pgfqpoint{1.907cm}{1.162cm}}
\pgfpathcurveto{\pgfqpoint{1.882cm}{1.187cm}}{\pgfqpoint{1.847cm}{1.202cm}}{\pgfqpoint{1.811cm}{1.202cm}}
\pgfpathcurveto{\pgfqpoint{1.775cm}{1.202cm}}{\pgfqpoint{1.74cm}{1.187cm}}{\pgfqpoint{1.714cm}{1.162cm}}
\pgfpathcurveto{\pgfqpoint{1.689cm}{1.136cm}}{\pgfqpoint{1.674cm}{1.101cm}}{\pgfqpoint{1.674cm}{1.065cm}}
\pgfpathcurveto{\pgfqpoint{1.674cm}{1.029cm}}{\pgfqpoint{1.689cm}{0.994cm}}{\pgfqpoint{1.714cm}{0.968cm}}
\pgfpathcurveto{\pgfqpoint{1.74cm}{0.942cm}}{\pgfqpoint{1.775cm}{0.928cm}}{\pgfqpoint{1.811cm}{0.928cm}}
\pgfpathcurveto{\pgfqpoint{1.847cm}{0.928cm}}{\pgfqpoint{1.882cm}{0.942cm}}{\pgfqpoint{1.907cm}{0.968cm}}
\pgfpathcurveto{\pgfqpoint{1.933cm}{0.994cm}}{\pgfqpoint{1.948cm}{1.029cm}}{\pgfqpoint{1.948cm}{1.065cm}}
\pgfusepath{fill}
\begin{pgfscope}
\pgfsetdash{}{0cm}
\pgfsetlinewidth{0.818mm}
\pgfsetmiterlimit{7.0}
\pgfpathmoveto{\pgfqpoint{1.246cm}{0.315cm}}
\pgfpathlineto{\pgfqpoint{1.244cm}{1.061cm}}
\pgfusepath{stroke}
\end{pgfscope}
\pgfpathmoveto{\pgfqpoint{1.38cm}{1.065cm}}
\pgfpathcurveto{\pgfqpoint{1.38cm}{1.101cm}}{\pgfqpoint{1.366cm}{1.136cm}}{\pgfqpoint{1.34cm}{1.162cm}}
\pgfpathcurveto{\pgfqpoint{1.315cm}{1.187cm}}{\pgfqpoint{1.28cm}{1.202cm}}{\pgfqpoint{1.244cm}{1.202cm}}
\pgfpathcurveto{\pgfqpoint{1.207cm}{1.202cm}}{\pgfqpoint{1.173cm}{1.187cm}}{\pgfqpoint{1.147cm}{1.162cm}}
\pgfpathcurveto{\pgfqpoint{1.121cm}{1.136cm}}{\pgfqpoint{1.107cm}{1.101cm}}{\pgfqpoint{1.107cm}{1.065cm}}
\pgfpathcurveto{\pgfqpoint{1.107cm}{1.029cm}}{\pgfqpoint{1.121cm}{0.994cm}}{\pgfqpoint{1.147cm}{0.968cm}}
\pgfpathcurveto{\pgfqpoint{1.173cm}{0.942cm}}{\pgfqpoint{1.207cm}{0.928cm}}{\pgfqpoint{1.244cm}{0.928cm}}
\pgfpathcurveto{\pgfqpoint{1.28cm}{0.928cm}}{\pgfqpoint{1.315cm}{0.942cm}}{\pgfqpoint{1.34cm}{0.968cm}}
\pgfpathcurveto{\pgfqpoint{1.366cm}{0.994cm}}{\pgfqpoint{1.38cm}{1.029cm}}{\pgfqpoint{1.38cm}{1.065cm}}
\pgfusepath{fill}
\begin{pgfscope}
\pgfsetdash{}{0cm}
\pgfsetlinewidth{0.818mm}
\pgfsetmiterlimit{4.0}
\pgfpathmoveto{\pgfqpoint{1.383cm}{0.178cm}}
\pgfpathcurveto{\pgfqpoint{1.383cm}{0.214cm}}{\pgfqpoint{1.369cm}{0.249cm}}{\pgfqpoint{1.343cm}{0.275cm}}
\pgfpathcurveto{\pgfqpoint{1.317cm}{0.3cm}}{\pgfqpoint{1.283cm}{0.315cm}}{\pgfqpoint{1.246cm}{0.315cm}}
\pgfpathcurveto{\pgfqpoint{1.21cm}{0.315cm}}{\pgfqpoint{1.175cm}{0.3cm}}{\pgfqpoint{1.15cm}{0.275cm}}
\pgfpathcurveto{\pgfqpoint{1.124cm}{0.249cm}}{\pgfqpoint{1.11cm}{0.214cm}}{\pgfqpoint{1.11cm}{0.178cm}}
\pgfpathcurveto{\pgfqpoint{1.11cm}{0.141cm}}{\pgfqpoint{1.124cm}{0.107cm}}{\pgfqpoint{1.15cm}{0.081cm}}
\pgfpathcurveto{\pgfqpoint{1.175cm}{0.055cm}}{\pgfqpoint{1.21cm}{0.041cm}}{\pgfqpoint{1.246cm}{0.041cm}}
\pgfpathcurveto{\pgfqpoint{1.283cm}{0.041cm}}{\pgfqpoint{1.317cm}{0.055cm}}{\pgfqpoint{1.343cm}{0.081cm}}
\pgfpathcurveto{\pgfqpoint{1.369cm}{0.107cm}}{\pgfqpoint{1.383cm}{0.141cm}}{\pgfqpoint{1.383cm}{0.178cm}}
\pgfusepath{stroke}
\end{pgfscope}
\end{pgfscope}
\end{pgfscope}
\end{pgfscope}
\end{tikzpicture}}} = \lim_{\varepsilon\to 0}X^{\!\resizebox{0.6em}{!}{
\begin{tikzpicture}
\pgfpathmoveto{\pgfqpoint{0cm}{-0.035cm}}
\pgfpathlineto{\pgfqpoint{1.376cm}{-0.035cm}}
\pgfpathlineto{\pgfqpoint{1.376cm}{1.552cm}}
\pgfpathlineto{\pgfqpoint{0cm}{1.552cm}}
\pgfpathclose
\pgfusepath{clip}
\begin{pgfscope}
\begin{pgfscope}
\pgfpathmoveto{\pgfqpoint{0cm}{-0.035cm}}
\pgfpathlineto{\pgfqpoint{1.376cm}{-0.035cm}}
\pgfpathlineto{\pgfqpoint{1.376cm}{1.552cm}}
\pgfpathlineto{\pgfqpoint{0cm}{1.552cm}}
\pgfpathclose
\pgfusepath{clip}
\begin{pgfscope}
\begin{pgfscope}
\pgfsetdash{}{0cm}
\pgfsetlinewidth{0.818mm}
\pgfsetroundcap
\pgfsetroundjoin
\pgfsetmiterlimit{7.0}
\definecolor{eps2pgf_color}{gray}{0}\pgfsetstrokecolor{eps2pgf_color}\pgfsetfillcolor{eps2pgf_color}
\pgfpathmoveto{\pgfqpoint{0.117cm}{1.421cm}}
\pgfpathlineto{\pgfqpoint{0.682cm}{0.671cm}}
\pgfpathlineto{\pgfqpoint{1.246cm}{1.421cm}}
\pgfusepath{stroke}
\end{pgfscope}
\definecolor{eps2pgf_color}{gray}{0}\pgfsetstrokecolor{eps2pgf_color}\pgfsetfillcolor{eps2pgf_color}
\pgfpathmoveto{\pgfqpoint{0.273cm}{1.395cm}}
\pgfpathcurveto{\pgfqpoint{0.273cm}{1.432cm}}{\pgfqpoint{0.259cm}{1.467cm}}{\pgfqpoint{0.233cm}{1.492cm}}
\pgfpathcurveto{\pgfqpoint{0.207cm}{1.518cm}}{\pgfqpoint{0.173cm}{1.532cm}}{\pgfqpoint{0.137cm}{1.532cm}}
\pgfpathcurveto{\pgfqpoint{0.1cm}{1.532cm}}{\pgfqpoint{0.066cm}{1.518cm}}{\pgfqpoint{0.04cm}{1.492cm}}
\pgfpathcurveto{\pgfqpoint{0.014cm}{1.467cm}}{\pgfqpoint{0cm}{1.432cm}}{\pgfqpoint{0cm}{1.395cm}}
\pgfpathcurveto{\pgfqpoint{0cm}{1.359cm}}{\pgfqpoint{0.014cm}{1.324cm}}{\pgfqpoint{0.04cm}{1.299cm}}
\pgfpathcurveto{\pgfqpoint{0.066cm}{1.273cm}}{\pgfqpoint{0.1cm}{1.258cm}}{\pgfqpoint{0.137cm}{1.258cm}}
\pgfpathcurveto{\pgfqpoint{0.173cm}{1.258cm}}{\pgfqpoint{0.207cm}{1.273cm}}{\pgfqpoint{0.233cm}{1.299cm}}
\pgfpathcurveto{\pgfqpoint{0.259cm}{1.324cm}}{\pgfqpoint{0.273cm}{1.359cm}}{\pgfqpoint{0.273cm}{1.395cm}}
\pgfusepath{fill}
\begin{pgfscope}
\pgfsetdash{}{0cm}
\pgfsetlinewidth{0.818mm}
\pgfsetmiterlimit{7.0}
\pgfpathmoveto{\pgfqpoint{0.682cm}{0.671cm}}
\pgfpathlineto{\pgfqpoint{0.679cm}{1.418cm}}
\pgfusepath{stroke}
\end{pgfscope}
\pgfpathmoveto{\pgfqpoint{0.815cm}{1.399cm}}
\pgfpathcurveto{\pgfqpoint{0.815cm}{1.435cm}}{\pgfqpoint{0.801cm}{1.47cm}}{\pgfqpoint{0.775cm}{1.496cm}}
\pgfpathcurveto{\pgfqpoint{0.75cm}{1.521cm}}{\pgfqpoint{0.715cm}{1.536cm}}{\pgfqpoint{0.679cm}{1.536cm}}
\pgfpathcurveto{\pgfqpoint{0.643cm}{1.536cm}}{\pgfqpoint{0.608cm}{1.521cm}}{\pgfqpoint{0.582cm}{1.496cm}}
\pgfpathcurveto{\pgfqpoint{0.557cm}{1.47cm}}{\pgfqpoint{0.542cm}{1.435cm}}{\pgfqpoint{0.542cm}{1.399cm}}
\pgfpathcurveto{\pgfqpoint{0.542cm}{1.363cm}}{\pgfqpoint{0.557cm}{1.328cm}}{\pgfqpoint{0.582cm}{1.302cm}}
\pgfpathcurveto{\pgfqpoint{0.608cm}{1.276cm}}{\pgfqpoint{0.643cm}{1.262cm}}{\pgfqpoint{0.679cm}{1.262cm}}
\pgfpathcurveto{\pgfqpoint{0.715cm}{1.262cm}}{\pgfqpoint{0.75cm}{1.276cm}}{\pgfqpoint{0.775cm}{1.302cm}}
\pgfpathcurveto{\pgfqpoint{0.801cm}{1.328cm}}{\pgfqpoint{0.815cm}{1.363cm}}{\pgfqpoint{0.815cm}{1.399cm}}
\pgfusepath{fill}
\pgfpathmoveto{\pgfqpoint{1.345cm}{1.371cm}}
\pgfpathcurveto{\pgfqpoint{1.345cm}{1.408cm}}{\pgfqpoint{1.331cm}{1.442cm}}{\pgfqpoint{1.305cm}{1.468cm}}
\pgfpathcurveto{\pgfqpoint{1.28cm}{1.494cm}}{\pgfqpoint{1.245cm}{1.508cm}}{\pgfqpoint{1.209cm}{1.508cm}}
\pgfpathcurveto{\pgfqpoint{1.172cm}{1.508cm}}{\pgfqpoint{1.138cm}{1.494cm}}{\pgfqpoint{1.112cm}{1.468cm}}
\pgfpathcurveto{\pgfqpoint{1.087cm}{1.442cm}}{\pgfqpoint{1.072cm}{1.408cm}}{\pgfqpoint{1.072cm}{1.371cm}}
\pgfpathcurveto{\pgfqpoint{1.072cm}{1.335cm}}{\pgfqpoint{1.087cm}{1.3cm}}{\pgfqpoint{1.112cm}{1.274cm}}
\pgfpathcurveto{\pgfqpoint{1.138cm}{1.249cm}}{\pgfqpoint{1.172cm}{1.234cm}}{\pgfqpoint{1.209cm}{1.234cm}}
\pgfpathcurveto{\pgfqpoint{1.245cm}{1.234cm}}{\pgfqpoint{1.28cm}{1.249cm}}{\pgfqpoint{1.305cm}{1.274cm}}
\pgfpathcurveto{\pgfqpoint{1.331cm}{1.3cm}}{\pgfqpoint{1.345cm}{1.335cm}}{\pgfqpoint{1.345cm}{1.371cm}}
\pgfusepath{fill}
\begin{pgfscope}
\pgfsetdash{}{0cm}
\pgfsetlinewidth{0.818mm}
\pgfsetroundcap
\pgfsetmiterlimit{4.0}
\pgfpathmoveto{\pgfqpoint{0.682cm}{0.671cm}}
\pgfpathlineto{\pgfqpoint{0.682cm}{0.042cm}}
\pgfusepath{stroke}
\end{pgfscope}
\end{pgfscope}
\end{pgfscope}
\end{pgfscope}
\end{tikzpicture}}}_{\varepsilon} \circ \llbracket X_{\varepsilon}^2 \rrbracket -  b_{\varepsilon}X_{\varepsilon}, \]
   where
   $$
   b_{\varepsilon} := 3 \mathbbm{E} [(X^{\!\resizebox{0.6em}{!}{
\begin{tikzpicture}
\pgfpathmoveto{\pgfqpoint{0cm}{0cm}}
\pgfpathlineto{\pgfqpoint{1.376cm}{0cm}}
\pgfpathlineto{\pgfqpoint{1.376cm}{1.588cm}}
\pgfpathlineto{\pgfqpoint{0cm}{1.588cm}}
\pgfpathclose
\pgfusepath{clip}
\begin{pgfscope}
\begin{pgfscope}
\pgfpathmoveto{\pgfqpoint{0cm}{0cm}}
\pgfpathlineto{\pgfqpoint{1.376cm}{0cm}}
\pgfpathlineto{\pgfqpoint{1.376cm}{1.588cm}}
\pgfpathlineto{\pgfqpoint{0cm}{1.588cm}}
\pgfpathclose
\pgfusepath{clip}
\begin{pgfscope}
\begin{pgfscope}
\definecolor{eps2pgf_color}{gray}{0.976471}\pgfsetstrokecolor{eps2pgf_color}\pgfsetfillcolor{eps2pgf_color}
\pgfpathmoveto{\pgfqpoint{0cm}{0cm}}
\pgfpathlineto{\pgfqpoint{1.376cm}{0cm}}
\pgfpathlineto{\pgfqpoint{1.376cm}{1.588cm}}
\pgfpathlineto{\pgfqpoint{0cm}{1.588cm}}
\pgfpathclose
\pgfusepath{fill}
\end{pgfscope}
\begin{pgfscope}
\pgfsetdash{}{0cm}
\pgfsetlinewidth{0.818mm}
\pgfsetroundcap
\pgfsetroundjoin
\pgfsetmiterlimit{7.0}
\definecolor{eps2pgf_color}{gray}{0}\pgfsetstrokecolor{eps2pgf_color}\pgfsetfillcolor{eps2pgf_color}
\pgfpathmoveto{\pgfqpoint{0.117cm}{1.476cm}}
\pgfpathlineto{\pgfqpoint{0.682cm}{0.726cm}}
\pgfpathlineto{\pgfqpoint{1.246cm}{1.476cm}}
\pgfusepath{stroke}
\end{pgfscope}
\definecolor{eps2pgf_color}{gray}{0}\pgfsetstrokecolor{eps2pgf_color}\pgfsetfillcolor{eps2pgf_color}
\pgfpathmoveto{\pgfqpoint{0.273cm}{1.451cm}}
\pgfpathcurveto{\pgfqpoint{0.273cm}{1.487cm}}{\pgfqpoint{0.259cm}{1.522cm}}{\pgfqpoint{0.233cm}{1.547cm}}
\pgfpathcurveto{\pgfqpoint{0.207cm}{1.573cm}}{\pgfqpoint{0.173cm}{1.588cm}}{\pgfqpoint{0.137cm}{1.588cm}}
\pgfpathcurveto{\pgfqpoint{0.1cm}{1.588cm}}{\pgfqpoint{0.066cm}{1.573cm}}{\pgfqpoint{0.04cm}{1.547cm}}
\pgfpathcurveto{\pgfqpoint{0.014cm}{1.522cm}}{\pgfqpoint{0cm}{1.487cm}}{\pgfqpoint{0cm}{1.451cm}}
\pgfpathcurveto{\pgfqpoint{0cm}{1.414cm}}{\pgfqpoint{0.014cm}{1.379cm}}{\pgfqpoint{0.04cm}{1.354cm}}
\pgfpathcurveto{\pgfqpoint{0.066cm}{1.328cm}}{\pgfqpoint{0.1cm}{1.314cm}}{\pgfqpoint{0.137cm}{1.314cm}}
\pgfpathcurveto{\pgfqpoint{0.173cm}{1.314cm}}{\pgfqpoint{0.207cm}{1.328cm}}{\pgfqpoint{0.233cm}{1.354cm}}
\pgfpathcurveto{\pgfqpoint{0.259cm}{1.379cm}}{\pgfqpoint{0.273cm}{1.414cm}}{\pgfqpoint{0.273cm}{1.451cm}}
\pgfusepath{fill}
\pgfpathmoveto{\pgfqpoint{1.345cm}{1.426cm}}
\pgfpathcurveto{\pgfqpoint{1.345cm}{1.463cm}}{\pgfqpoint{1.331cm}{1.497cm}}{\pgfqpoint{1.305cm}{1.523cm}}
\pgfpathcurveto{\pgfqpoint{1.28cm}{1.549cm}}{\pgfqpoint{1.245cm}{1.563cm}}{\pgfqpoint{1.209cm}{1.563cm}}
\pgfpathcurveto{\pgfqpoint{1.172cm}{1.563cm}}{\pgfqpoint{1.138cm}{1.549cm}}{\pgfqpoint{1.112cm}{1.523cm}}
\pgfpathcurveto{\pgfqpoint{1.087cm}{1.497cm}}{\pgfqpoint{1.072cm}{1.463cm}}{\pgfqpoint{1.072cm}{1.426cm}}
\pgfpathcurveto{\pgfqpoint{1.072cm}{1.39cm}}{\pgfqpoint{1.087cm}{1.355cm}}{\pgfqpoint{1.112cm}{1.329cm}}
\pgfpathcurveto{\pgfqpoint{1.138cm}{1.304cm}}{\pgfqpoint{1.172cm}{1.289cm}}{\pgfqpoint{1.209cm}{1.289cm}}
\pgfpathcurveto{\pgfqpoint{1.245cm}{1.289cm}}{\pgfqpoint{1.28cm}{1.304cm}}{\pgfqpoint{1.305cm}{1.329cm}}
\pgfpathcurveto{\pgfqpoint{1.331cm}{1.355cm}}{\pgfqpoint{1.345cm}{1.39cm}}{\pgfqpoint{1.345cm}{1.426cm}}
\pgfusepath{fill}
\begin{pgfscope}
\pgfsetdash{}{0cm}
\pgfsetlinewidth{0.818mm}
\pgfsetroundcap
\pgfsetmiterlimit{4.0}
\pgfpathmoveto{\pgfqpoint{0.682cm}{0.726cm}}
\pgfpathlineto{\pgfqpoint{0.682cm}{0.097cm}}
\pgfusepath{stroke}
\end{pgfscope}
\end{pgfscope}
\end{pgfscope}
\end{pgfscope}
\end{tikzpicture}}}_{\varepsilon} \circ \llbracket X_{\varepsilon}^2 \rrbracket)(0)].
   $$
 Similarly, we define the periodic analogs.
 
  \renewcommand{\arraystretch}{1.5}
   \begin{table}
      \begin{center}
  \begin{tabular}{| r | c | c | c | c|c|c|c|c|}
  \hline
  $\tau$ & $X$ & $ \llbracket X^2 \rrbracket$ & $ \llbracket X^3 \rrbracket$ & $X^{\!\resizebox{0.6em}{!}{
\begin{tikzpicture}
\pgfpathmoveto{\pgfqpoint{0cm}{-0.035cm}}
\pgfpathlineto{\pgfqpoint{1.376cm}{-0.035cm}}
\pgfpathlineto{\pgfqpoint{1.376cm}{1.552cm}}
\pgfpathlineto{\pgfqpoint{0cm}{1.552cm}}
\pgfpathclose
\pgfusepath{clip}
\begin{pgfscope}
\begin{pgfscope}
\pgfpathmoveto{\pgfqpoint{0cm}{-0.035cm}}
\pgfpathlineto{\pgfqpoint{1.376cm}{-0.035cm}}
\pgfpathlineto{\pgfqpoint{1.376cm}{1.552cm}}
\pgfpathlineto{\pgfqpoint{0cm}{1.552cm}}
\pgfpathclose
\pgfusepath{clip}
\begin{pgfscope}
\begin{pgfscope}
\pgfsetdash{}{0cm}
\pgfsetlinewidth{0.818mm}
\pgfsetroundcap
\pgfsetroundjoin
\pgfsetmiterlimit{7.0}
\definecolor{eps2pgf_color}{gray}{0}\pgfsetstrokecolor{eps2pgf_color}\pgfsetfillcolor{eps2pgf_color}
\pgfpathmoveto{\pgfqpoint{0.117cm}{1.421cm}}
\pgfpathlineto{\pgfqpoint{0.682cm}{0.671cm}}
\pgfpathlineto{\pgfqpoint{1.246cm}{1.421cm}}
\pgfusepath{stroke}
\end{pgfscope}
\definecolor{eps2pgf_color}{gray}{0}\pgfsetstrokecolor{eps2pgf_color}\pgfsetfillcolor{eps2pgf_color}
\pgfpathmoveto{\pgfqpoint{0.273cm}{1.395cm}}
\pgfpathcurveto{\pgfqpoint{0.273cm}{1.432cm}}{\pgfqpoint{0.259cm}{1.467cm}}{\pgfqpoint{0.233cm}{1.492cm}}
\pgfpathcurveto{\pgfqpoint{0.207cm}{1.518cm}}{\pgfqpoint{0.173cm}{1.532cm}}{\pgfqpoint{0.137cm}{1.532cm}}
\pgfpathcurveto{\pgfqpoint{0.1cm}{1.532cm}}{\pgfqpoint{0.066cm}{1.518cm}}{\pgfqpoint{0.04cm}{1.492cm}}
\pgfpathcurveto{\pgfqpoint{0.014cm}{1.467cm}}{\pgfqpoint{0cm}{1.432cm}}{\pgfqpoint{0cm}{1.395cm}}
\pgfpathcurveto{\pgfqpoint{0cm}{1.359cm}}{\pgfqpoint{0.014cm}{1.324cm}}{\pgfqpoint{0.04cm}{1.299cm}}
\pgfpathcurveto{\pgfqpoint{0.066cm}{1.273cm}}{\pgfqpoint{0.1cm}{1.258cm}}{\pgfqpoint{0.137cm}{1.258cm}}
\pgfpathcurveto{\pgfqpoint{0.173cm}{1.258cm}}{\pgfqpoint{0.207cm}{1.273cm}}{\pgfqpoint{0.233cm}{1.299cm}}
\pgfpathcurveto{\pgfqpoint{0.259cm}{1.324cm}}{\pgfqpoint{0.273cm}{1.359cm}}{\pgfqpoint{0.273cm}{1.395cm}}
\pgfusepath{fill}
\begin{pgfscope}
\pgfsetdash{}{0cm}
\pgfsetlinewidth{0.818mm}
\pgfsetmiterlimit{7.0}
\pgfpathmoveto{\pgfqpoint{0.682cm}{0.671cm}}
\pgfpathlineto{\pgfqpoint{0.679cm}{1.418cm}}
\pgfusepath{stroke}
\end{pgfscope}
\pgfpathmoveto{\pgfqpoint{0.815cm}{1.399cm}}
\pgfpathcurveto{\pgfqpoint{0.815cm}{1.435cm}}{\pgfqpoint{0.801cm}{1.47cm}}{\pgfqpoint{0.775cm}{1.496cm}}
\pgfpathcurveto{\pgfqpoint{0.75cm}{1.521cm}}{\pgfqpoint{0.715cm}{1.536cm}}{\pgfqpoint{0.679cm}{1.536cm}}
\pgfpathcurveto{\pgfqpoint{0.643cm}{1.536cm}}{\pgfqpoint{0.608cm}{1.521cm}}{\pgfqpoint{0.582cm}{1.496cm}}
\pgfpathcurveto{\pgfqpoint{0.557cm}{1.47cm}}{\pgfqpoint{0.542cm}{1.435cm}}{\pgfqpoint{0.542cm}{1.399cm}}
\pgfpathcurveto{\pgfqpoint{0.542cm}{1.363cm}}{\pgfqpoint{0.557cm}{1.328cm}}{\pgfqpoint{0.582cm}{1.302cm}}
\pgfpathcurveto{\pgfqpoint{0.608cm}{1.276cm}}{\pgfqpoint{0.643cm}{1.262cm}}{\pgfqpoint{0.679cm}{1.262cm}}
\pgfpathcurveto{\pgfqpoint{0.715cm}{1.262cm}}{\pgfqpoint{0.75cm}{1.276cm}}{\pgfqpoint{0.775cm}{1.302cm}}
\pgfpathcurveto{\pgfqpoint{0.801cm}{1.328cm}}{\pgfqpoint{0.815cm}{1.363cm}}{\pgfqpoint{0.815cm}{1.399cm}}
\pgfusepath{fill}
\pgfpathmoveto{\pgfqpoint{1.345cm}{1.371cm}}
\pgfpathcurveto{\pgfqpoint{1.345cm}{1.408cm}}{\pgfqpoint{1.331cm}{1.442cm}}{\pgfqpoint{1.305cm}{1.468cm}}
\pgfpathcurveto{\pgfqpoint{1.28cm}{1.494cm}}{\pgfqpoint{1.245cm}{1.508cm}}{\pgfqpoint{1.209cm}{1.508cm}}
\pgfpathcurveto{\pgfqpoint{1.172cm}{1.508cm}}{\pgfqpoint{1.138cm}{1.494cm}}{\pgfqpoint{1.112cm}{1.468cm}}
\pgfpathcurveto{\pgfqpoint{1.087cm}{1.442cm}}{\pgfqpoint{1.072cm}{1.408cm}}{\pgfqpoint{1.072cm}{1.371cm}}
\pgfpathcurveto{\pgfqpoint{1.072cm}{1.335cm}}{\pgfqpoint{1.087cm}{1.3cm}}{\pgfqpoint{1.112cm}{1.274cm}}
\pgfpathcurveto{\pgfqpoint{1.138cm}{1.249cm}}{\pgfqpoint{1.172cm}{1.234cm}}{\pgfqpoint{1.209cm}{1.234cm}}
\pgfpathcurveto{\pgfqpoint{1.245cm}{1.234cm}}{\pgfqpoint{1.28cm}{1.249cm}}{\pgfqpoint{1.305cm}{1.274cm}}
\pgfpathcurveto{\pgfqpoint{1.331cm}{1.3cm}}{\pgfqpoint{1.345cm}{1.335cm}}{\pgfqpoint{1.345cm}{1.371cm}}
\pgfusepath{fill}
\begin{pgfscope}
\pgfsetdash{}{0cm}
\pgfsetlinewidth{0.818mm}
\pgfsetroundcap
\pgfsetmiterlimit{4.0}
\pgfpathmoveto{\pgfqpoint{0.682cm}{0.671cm}}
\pgfpathlineto{\pgfqpoint{0.682cm}{0.042cm}}
\pgfusepath{stroke}
\end{pgfscope}
\end{pgfscope}
\end{pgfscope}
\end{pgfscope}
\end{tikzpicture}}}$ & $X^{\!\resizebox{0.6em}{!}{
\begin{tikzpicture}
\pgfpathmoveto{\pgfqpoint{0cm}{0cm}}
\pgfpathlineto{\pgfqpoint{1.376cm}{0cm}}
\pgfpathlineto{\pgfqpoint{1.376cm}{1.588cm}}
\pgfpathlineto{\pgfqpoint{0cm}{1.588cm}}
\pgfpathclose
\pgfusepath{clip}
\begin{pgfscope}
\begin{pgfscope}
\pgfpathmoveto{\pgfqpoint{0cm}{0cm}}
\pgfpathlineto{\pgfqpoint{1.376cm}{0cm}}
\pgfpathlineto{\pgfqpoint{1.376cm}{1.588cm}}
\pgfpathlineto{\pgfqpoint{0cm}{1.588cm}}
\pgfpathclose
\pgfusepath{clip}
\begin{pgfscope}
\begin{pgfscope}
\definecolor{eps2pgf_color}{gray}{0.976471}\pgfsetstrokecolor{eps2pgf_color}\pgfsetfillcolor{eps2pgf_color}
\pgfpathmoveto{\pgfqpoint{0cm}{0cm}}
\pgfpathlineto{\pgfqpoint{1.376cm}{0cm}}
\pgfpathlineto{\pgfqpoint{1.376cm}{1.588cm}}
\pgfpathlineto{\pgfqpoint{0cm}{1.588cm}}
\pgfpathclose
\pgfusepath{fill}
\end{pgfscope}
\begin{pgfscope}
\pgfsetdash{}{0cm}
\pgfsetlinewidth{0.818mm}
\pgfsetroundcap
\pgfsetroundjoin
\pgfsetmiterlimit{7.0}
\definecolor{eps2pgf_color}{gray}{0}\pgfsetstrokecolor{eps2pgf_color}\pgfsetfillcolor{eps2pgf_color}
\pgfpathmoveto{\pgfqpoint{0.117cm}{1.476cm}}
\pgfpathlineto{\pgfqpoint{0.682cm}{0.726cm}}
\pgfpathlineto{\pgfqpoint{1.246cm}{1.476cm}}
\pgfusepath{stroke}
\end{pgfscope}
\definecolor{eps2pgf_color}{gray}{0}\pgfsetstrokecolor{eps2pgf_color}\pgfsetfillcolor{eps2pgf_color}
\pgfpathmoveto{\pgfqpoint{0.273cm}{1.451cm}}
\pgfpathcurveto{\pgfqpoint{0.273cm}{1.487cm}}{\pgfqpoint{0.259cm}{1.522cm}}{\pgfqpoint{0.233cm}{1.547cm}}
\pgfpathcurveto{\pgfqpoint{0.207cm}{1.573cm}}{\pgfqpoint{0.173cm}{1.588cm}}{\pgfqpoint{0.137cm}{1.588cm}}
\pgfpathcurveto{\pgfqpoint{0.1cm}{1.588cm}}{\pgfqpoint{0.066cm}{1.573cm}}{\pgfqpoint{0.04cm}{1.547cm}}
\pgfpathcurveto{\pgfqpoint{0.014cm}{1.522cm}}{\pgfqpoint{0cm}{1.487cm}}{\pgfqpoint{0cm}{1.451cm}}
\pgfpathcurveto{\pgfqpoint{0cm}{1.414cm}}{\pgfqpoint{0.014cm}{1.379cm}}{\pgfqpoint{0.04cm}{1.354cm}}
\pgfpathcurveto{\pgfqpoint{0.066cm}{1.328cm}}{\pgfqpoint{0.1cm}{1.314cm}}{\pgfqpoint{0.137cm}{1.314cm}}
\pgfpathcurveto{\pgfqpoint{0.173cm}{1.314cm}}{\pgfqpoint{0.207cm}{1.328cm}}{\pgfqpoint{0.233cm}{1.354cm}}
\pgfpathcurveto{\pgfqpoint{0.259cm}{1.379cm}}{\pgfqpoint{0.273cm}{1.414cm}}{\pgfqpoint{0.273cm}{1.451cm}}
\pgfusepath{fill}
\pgfpathmoveto{\pgfqpoint{1.345cm}{1.426cm}}
\pgfpathcurveto{\pgfqpoint{1.345cm}{1.463cm}}{\pgfqpoint{1.331cm}{1.497cm}}{\pgfqpoint{1.305cm}{1.523cm}}
\pgfpathcurveto{\pgfqpoint{1.28cm}{1.549cm}}{\pgfqpoint{1.245cm}{1.563cm}}{\pgfqpoint{1.209cm}{1.563cm}}
\pgfpathcurveto{\pgfqpoint{1.172cm}{1.563cm}}{\pgfqpoint{1.138cm}{1.549cm}}{\pgfqpoint{1.112cm}{1.523cm}}
\pgfpathcurveto{\pgfqpoint{1.087cm}{1.497cm}}{\pgfqpoint{1.072cm}{1.463cm}}{\pgfqpoint{1.072cm}{1.426cm}}
\pgfpathcurveto{\pgfqpoint{1.072cm}{1.39cm}}{\pgfqpoint{1.087cm}{1.355cm}}{\pgfqpoint{1.112cm}{1.329cm}}
\pgfpathcurveto{\pgfqpoint{1.138cm}{1.304cm}}{\pgfqpoint{1.172cm}{1.289cm}}{\pgfqpoint{1.209cm}{1.289cm}}
\pgfpathcurveto{\pgfqpoint{1.245cm}{1.289cm}}{\pgfqpoint{1.28cm}{1.304cm}}{\pgfqpoint{1.305cm}{1.329cm}}
\pgfpathcurveto{\pgfqpoint{1.331cm}{1.355cm}}{\pgfqpoint{1.345cm}{1.39cm}}{\pgfqpoint{1.345cm}{1.426cm}}
\pgfusepath{fill}
\begin{pgfscope}
\pgfsetdash{}{0cm}
\pgfsetlinewidth{0.818mm}
\pgfsetroundcap
\pgfsetmiterlimit{4.0}
\pgfpathmoveto{\pgfqpoint{0.682cm}{0.726cm}}
\pgfpathlineto{\pgfqpoint{0.682cm}{0.097cm}}
\pgfusepath{stroke}
\end{pgfscope}
\end{pgfscope}
\end{pgfscope}
\end{pgfscope}
\end{tikzpicture}}}$ & $X^{\!\resizebox{!}{.8em}{
\begin{tikzpicture}
\pgfpathmoveto{\pgfqpoint{0cm}{-0.035cm}}
\pgfpathlineto{\pgfqpoint{1.976cm}{-0.035cm}}
\pgfpathlineto{\pgfqpoint{1.976cm}{1.94cm}}
\pgfpathlineto{\pgfqpoint{0cm}{1.94cm}}
\pgfpathclose
\pgfusepath{clip}
\begin{pgfscope}
\begin{pgfscope}
\pgfpathmoveto{\pgfqpoint{0cm}{-0.035cm}}
\pgfpathlineto{\pgfqpoint{1.976cm}{-0.035cm}}
\pgfpathlineto{\pgfqpoint{1.976cm}{1.94cm}}
\pgfpathlineto{\pgfqpoint{0cm}{1.94cm}}
\pgfpathclose
\pgfusepath{clip}
\begin{pgfscope}
\begin{pgfscope}
\pgfsetdash{}{0cm}
\pgfsetlinewidth{0.818mm}
\pgfsetroundcap
\pgfsetroundjoin
\pgfsetmiterlimit{7.0}
\definecolor{eps2pgf_color}{gray}{0}\pgfsetstrokecolor{eps2pgf_color}\pgfsetfillcolor{eps2pgf_color}
\pgfpathmoveto{\pgfqpoint{0.117cm}{1.815cm}}
\pgfpathlineto{\pgfqpoint{0.682cm}{1.065cm}}
\pgfpathlineto{\pgfqpoint{1.246cm}{1.815cm}}
\pgfusepath{stroke}
\end{pgfscope}
\definecolor{eps2pgf_color}{gray}{0}\pgfsetstrokecolor{eps2pgf_color}\pgfsetfillcolor{eps2pgf_color}
\pgfpathmoveto{\pgfqpoint{0.273cm}{1.789cm}}
\pgfpathcurveto{\pgfqpoint{0.273cm}{1.825cm}}{\pgfqpoint{0.259cm}{1.86cm}}{\pgfqpoint{0.233cm}{1.886cm}}
\pgfpathcurveto{\pgfqpoint{0.207cm}{1.912cm}}{\pgfqpoint{0.173cm}{1.926cm}}{\pgfqpoint{0.137cm}{1.926cm}}
\pgfpathcurveto{\pgfqpoint{0.1cm}{1.926cm}}{\pgfqpoint{0.066cm}{1.912cm}}{\pgfqpoint{0.04cm}{1.886cm}}
\pgfpathcurveto{\pgfqpoint{0.014cm}{1.86cm}}{\pgfqpoint{0cm}{1.825cm}}{\pgfqpoint{0cm}{1.789cm}}
\pgfpathcurveto{\pgfqpoint{0cm}{1.753cm}}{\pgfqpoint{0.014cm}{1.718cm}}{\pgfqpoint{0.04cm}{1.692cm}}
\pgfpathcurveto{\pgfqpoint{0.066cm}{1.667cm}}{\pgfqpoint{0.1cm}{1.652cm}}{\pgfqpoint{0.137cm}{1.652cm}}
\pgfpathcurveto{\pgfqpoint{0.173cm}{1.652cm}}{\pgfqpoint{0.207cm}{1.667cm}}{\pgfqpoint{0.233cm}{1.692cm}}
\pgfpathcurveto{\pgfqpoint{0.259cm}{1.718cm}}{\pgfqpoint{0.273cm}{1.753cm}}{\pgfqpoint{0.273cm}{1.789cm}}
\pgfusepath{fill}
\begin{pgfscope}
\pgfsetdash{}{0cm}
\pgfsetlinewidth{0.818mm}
\pgfsetmiterlimit{7.0}
\pgfpathmoveto{\pgfqpoint{0.682cm}{1.065cm}}
\pgfpathlineto{\pgfqpoint{0.679cm}{1.812cm}}
\pgfusepath{stroke}
\end{pgfscope}
\pgfpathmoveto{\pgfqpoint{0.815cm}{1.793cm}}
\pgfpathcurveto{\pgfqpoint{0.815cm}{1.829cm}}{\pgfqpoint{0.801cm}{1.864cm}}{\pgfqpoint{0.775cm}{1.89cm}}
\pgfpathcurveto{\pgfqpoint{0.75cm}{1.915cm}}{\pgfqpoint{0.715cm}{1.93cm}}{\pgfqpoint{0.679cm}{1.93cm}}
\pgfpathcurveto{\pgfqpoint{0.643cm}{1.93cm}}{\pgfqpoint{0.608cm}{1.915cm}}{\pgfqpoint{0.582cm}{1.89cm}}
\pgfpathcurveto{\pgfqpoint{0.557cm}{1.864cm}}{\pgfqpoint{0.542cm}{1.829cm}}{\pgfqpoint{0.542cm}{1.793cm}}
\pgfpathcurveto{\pgfqpoint{0.542cm}{1.756cm}}{\pgfqpoint{0.557cm}{1.722cm}}{\pgfqpoint{0.582cm}{1.696cm}}
\pgfpathcurveto{\pgfqpoint{0.608cm}{1.67cm}}{\pgfqpoint{0.643cm}{1.656cm}}{\pgfqpoint{0.679cm}{1.656cm}}
\pgfpathcurveto{\pgfqpoint{0.715cm}{1.656cm}}{\pgfqpoint{0.75cm}{1.67cm}}{\pgfqpoint{0.775cm}{1.696cm}}
\pgfpathcurveto{\pgfqpoint{0.801cm}{1.722cm}}{\pgfqpoint{0.815cm}{1.756cm}}{\pgfqpoint{0.815cm}{1.793cm}}
\pgfusepath{fill}
\pgfpathmoveto{\pgfqpoint{1.345cm}{1.765cm}}
\pgfpathcurveto{\pgfqpoint{1.345cm}{1.801cm}}{\pgfqpoint{1.331cm}{1.836cm}}{\pgfqpoint{1.305cm}{1.862cm}}
\pgfpathcurveto{\pgfqpoint{1.28cm}{1.887cm}}{\pgfqpoint{1.245cm}{1.902cm}}{\pgfqpoint{1.209cm}{1.902cm}}
\pgfpathcurveto{\pgfqpoint{1.172cm}{1.902cm}}{\pgfqpoint{1.138cm}{1.887cm}}{\pgfqpoint{1.112cm}{1.862cm}}
\pgfpathcurveto{\pgfqpoint{1.087cm}{1.836cm}}{\pgfqpoint{1.072cm}{1.801cm}}{\pgfqpoint{1.072cm}{1.765cm}}
\pgfpathcurveto{\pgfqpoint{1.072cm}{1.728cm}}{\pgfqpoint{1.087cm}{1.694cm}}{\pgfqpoint{1.112cm}{1.668cm}}
\pgfpathcurveto{\pgfqpoint{1.138cm}{1.642cm}}{\pgfqpoint{1.172cm}{1.628cm}}{\pgfqpoint{1.209cm}{1.628cm}}
\pgfpathcurveto{\pgfqpoint{1.245cm}{1.628cm}}{\pgfqpoint{1.28cm}{1.642cm}}{\pgfqpoint{1.305cm}{1.668cm}}
\pgfpathcurveto{\pgfqpoint{1.331cm}{1.694cm}}{\pgfqpoint{1.345cm}{1.728cm}}{\pgfqpoint{1.345cm}{1.765cm}}
\pgfusepath{fill}
\begin{pgfscope}
\pgfsetdash{}{0cm}
\pgfsetlinewidth{0.818mm}
\pgfsetroundcap
\pgfsetroundjoin
\pgfsetmiterlimit{7.0}
\pgfpathmoveto{\pgfqpoint{0.682cm}{1.065cm}}
\pgfpathlineto{\pgfqpoint{1.246cm}{0.315cm}}
\pgfpathlineto{\pgfqpoint{1.811cm}{1.065cm}}
\pgfusepath{stroke}
\end{pgfscope}
\pgfpathmoveto{\pgfqpoint{1.948cm}{1.065cm}}
\pgfpathcurveto{\pgfqpoint{1.948cm}{1.101cm}}{\pgfqpoint{1.933cm}{1.136cm}}{\pgfqpoint{1.907cm}{1.162cm}}
\pgfpathcurveto{\pgfqpoint{1.882cm}{1.187cm}}{\pgfqpoint{1.847cm}{1.202cm}}{\pgfqpoint{1.811cm}{1.202cm}}
\pgfpathcurveto{\pgfqpoint{1.775cm}{1.202cm}}{\pgfqpoint{1.74cm}{1.187cm}}{\pgfqpoint{1.714cm}{1.162cm}}
\pgfpathcurveto{\pgfqpoint{1.689cm}{1.136cm}}{\pgfqpoint{1.674cm}{1.101cm}}{\pgfqpoint{1.674cm}{1.065cm}}
\pgfpathcurveto{\pgfqpoint{1.674cm}{1.029cm}}{\pgfqpoint{1.689cm}{0.994cm}}{\pgfqpoint{1.714cm}{0.968cm}}
\pgfpathcurveto{\pgfqpoint{1.74cm}{0.942cm}}{\pgfqpoint{1.775cm}{0.928cm}}{\pgfqpoint{1.811cm}{0.928cm}}
\pgfpathcurveto{\pgfqpoint{1.847cm}{0.928cm}}{\pgfqpoint{1.882cm}{0.942cm}}{\pgfqpoint{1.907cm}{0.968cm}}
\pgfpathcurveto{\pgfqpoint{1.933cm}{0.994cm}}{\pgfqpoint{1.948cm}{1.029cm}}{\pgfqpoint{1.948cm}{1.065cm}}
\pgfusepath{fill}
\begin{pgfscope}
\pgfsetdash{}{0cm}
\pgfsetlinewidth{0.818mm}
\pgfsetmiterlimit{4.0}
\pgfpathmoveto{\pgfqpoint{1.383cm}{0.178cm}}
\pgfpathcurveto{\pgfqpoint{1.383cm}{0.214cm}}{\pgfqpoint{1.369cm}{0.249cm}}{\pgfqpoint{1.343cm}{0.275cm}}
\pgfpathcurveto{\pgfqpoint{1.317cm}{0.3cm}}{\pgfqpoint{1.283cm}{0.315cm}}{\pgfqpoint{1.246cm}{0.315cm}}
\pgfpathcurveto{\pgfqpoint{1.21cm}{0.315cm}}{\pgfqpoint{1.175cm}{0.3cm}}{\pgfqpoint{1.15cm}{0.275cm}}
\pgfpathcurveto{\pgfqpoint{1.124cm}{0.249cm}}{\pgfqpoint{1.11cm}{0.214cm}}{\pgfqpoint{1.11cm}{0.178cm}}
\pgfpathcurveto{\pgfqpoint{1.11cm}{0.141cm}}{\pgfqpoint{1.124cm}{0.107cm}}{\pgfqpoint{1.15cm}{0.081cm}}
\pgfpathcurveto{\pgfqpoint{1.175cm}{0.055cm}}{\pgfqpoint{1.21cm}{0.041cm}}{\pgfqpoint{1.246cm}{0.041cm}}
\pgfpathcurveto{\pgfqpoint{1.283cm}{0.041cm}}{\pgfqpoint{1.317cm}{0.055cm}}{\pgfqpoint{1.343cm}{0.081cm}}
\pgfpathcurveto{\pgfqpoint{1.369cm}{0.107cm}}{\pgfqpoint{1.383cm}{0.141cm}}{\pgfqpoint{1.383cm}{0.178cm}}
\pgfusepath{stroke}
\end{pgfscope}
\end{pgfscope}
\end{pgfscope}
\end{pgfscope}
\end{tikzpicture}}}$  & $X^{\!\resizebox{!}{.8em}{
\begin{tikzpicture}
\pgfpathmoveto{\pgfqpoint{0cm}{-0.035cm}}
\pgfpathlineto{\pgfqpoint{1.976cm}{-0.035cm}}
\pgfpathlineto{\pgfqpoint{1.976cm}{1.94cm}}
\pgfpathlineto{\pgfqpoint{0cm}{1.94cm}}
\pgfpathclose
\pgfusepath{clip}
\begin{pgfscope}
\begin{pgfscope}
\pgfpathmoveto{\pgfqpoint{0cm}{-0.035cm}}
\pgfpathlineto{\pgfqpoint{1.976cm}{-0.035cm}}
\pgfpathlineto{\pgfqpoint{1.976cm}{1.94cm}}
\pgfpathlineto{\pgfqpoint{0cm}{1.94cm}}
\pgfpathclose
\pgfusepath{clip}
\begin{pgfscope}
\begin{pgfscope}
\pgfsetdash{}{0cm}
\pgfsetlinewidth{0.818mm}
\pgfsetroundcap
\pgfsetroundjoin
\pgfsetmiterlimit{7.0}
\definecolor{eps2pgf_color}{gray}{0}\pgfsetstrokecolor{eps2pgf_color}\pgfsetfillcolor{eps2pgf_color}
\pgfpathmoveto{\pgfqpoint{0.117cm}{1.815cm}}
\pgfpathlineto{\pgfqpoint{0.682cm}{1.065cm}}
\pgfpathlineto{\pgfqpoint{1.246cm}{1.815cm}}
\pgfusepath{stroke}
\end{pgfscope}
\definecolor{eps2pgf_color}{gray}{0}\pgfsetstrokecolor{eps2pgf_color}\pgfsetfillcolor{eps2pgf_color}
\pgfpathmoveto{\pgfqpoint{0.273cm}{1.789cm}}
\pgfpathcurveto{\pgfqpoint{0.273cm}{1.825cm}}{\pgfqpoint{0.259cm}{1.86cm}}{\pgfqpoint{0.233cm}{1.886cm}}
\pgfpathcurveto{\pgfqpoint{0.207cm}{1.912cm}}{\pgfqpoint{0.173cm}{1.926cm}}{\pgfqpoint{0.137cm}{1.926cm}}
\pgfpathcurveto{\pgfqpoint{0.1cm}{1.926cm}}{\pgfqpoint{0.066cm}{1.912cm}}{\pgfqpoint{0.04cm}{1.886cm}}
\pgfpathcurveto{\pgfqpoint{0.014cm}{1.86cm}}{\pgfqpoint{0cm}{1.825cm}}{\pgfqpoint{0cm}{1.789cm}}
\pgfpathcurveto{\pgfqpoint{0cm}{1.753cm}}{\pgfqpoint{0.014cm}{1.718cm}}{\pgfqpoint{0.04cm}{1.692cm}}
\pgfpathcurveto{\pgfqpoint{0.066cm}{1.667cm}}{\pgfqpoint{0.1cm}{1.652cm}}{\pgfqpoint{0.137cm}{1.652cm}}
\pgfpathcurveto{\pgfqpoint{0.173cm}{1.652cm}}{\pgfqpoint{0.207cm}{1.667cm}}{\pgfqpoint{0.233cm}{1.692cm}}
\pgfpathcurveto{\pgfqpoint{0.259cm}{1.718cm}}{\pgfqpoint{0.273cm}{1.753cm}}{\pgfqpoint{0.273cm}{1.789cm}}
\pgfusepath{fill}
\pgfpathmoveto{\pgfqpoint{1.345cm}{1.765cm}}
\pgfpathcurveto{\pgfqpoint{1.345cm}{1.801cm}}{\pgfqpoint{1.331cm}{1.836cm}}{\pgfqpoint{1.305cm}{1.862cm}}
\pgfpathcurveto{\pgfqpoint{1.28cm}{1.887cm}}{\pgfqpoint{1.245cm}{1.902cm}}{\pgfqpoint{1.209cm}{1.902cm}}
\pgfpathcurveto{\pgfqpoint{1.172cm}{1.902cm}}{\pgfqpoint{1.138cm}{1.887cm}}{\pgfqpoint{1.112cm}{1.862cm}}
\pgfpathcurveto{\pgfqpoint{1.087cm}{1.836cm}}{\pgfqpoint{1.072cm}{1.801cm}}{\pgfqpoint{1.072cm}{1.765cm}}
\pgfpathcurveto{\pgfqpoint{1.072cm}{1.728cm}}{\pgfqpoint{1.087cm}{1.694cm}}{\pgfqpoint{1.112cm}{1.668cm}}
\pgfpathcurveto{\pgfqpoint{1.138cm}{1.642cm}}{\pgfqpoint{1.172cm}{1.628cm}}{\pgfqpoint{1.209cm}{1.628cm}}
\pgfpathcurveto{\pgfqpoint{1.245cm}{1.628cm}}{\pgfqpoint{1.28cm}{1.642cm}}{\pgfqpoint{1.305cm}{1.668cm}}
\pgfpathcurveto{\pgfqpoint{1.331cm}{1.694cm}}{\pgfqpoint{1.345cm}{1.728cm}}{\pgfqpoint{1.345cm}{1.765cm}}
\pgfusepath{fill}
\begin{pgfscope}
\pgfsetdash{}{0cm}
\pgfsetlinewidth{0.818mm}
\pgfsetroundcap
\pgfsetroundjoin
\pgfsetmiterlimit{7.0}
\pgfpathmoveto{\pgfqpoint{0.682cm}{1.065cm}}
\pgfpathlineto{\pgfqpoint{1.246cm}{0.315cm}}
\pgfpathlineto{\pgfqpoint{1.811cm}{1.065cm}}
\pgfusepath{stroke}
\end{pgfscope}
\pgfpathmoveto{\pgfqpoint{1.948cm}{1.065cm}}
\pgfpathcurveto{\pgfqpoint{1.948cm}{1.101cm}}{\pgfqpoint{1.933cm}{1.136cm}}{\pgfqpoint{1.907cm}{1.162cm}}
\pgfpathcurveto{\pgfqpoint{1.882cm}{1.187cm}}{\pgfqpoint{1.847cm}{1.202cm}}{\pgfqpoint{1.811cm}{1.202cm}}
\pgfpathcurveto{\pgfqpoint{1.775cm}{1.202cm}}{\pgfqpoint{1.74cm}{1.187cm}}{\pgfqpoint{1.714cm}{1.162cm}}
\pgfpathcurveto{\pgfqpoint{1.689cm}{1.136cm}}{\pgfqpoint{1.674cm}{1.101cm}}{\pgfqpoint{1.674cm}{1.065cm}}
\pgfpathcurveto{\pgfqpoint{1.674cm}{1.029cm}}{\pgfqpoint{1.689cm}{0.994cm}}{\pgfqpoint{1.714cm}{0.968cm}}
\pgfpathcurveto{\pgfqpoint{1.74cm}{0.942cm}}{\pgfqpoint{1.775cm}{0.928cm}}{\pgfqpoint{1.811cm}{0.928cm}}
\pgfpathcurveto{\pgfqpoint{1.847cm}{0.928cm}}{\pgfqpoint{1.882cm}{0.942cm}}{\pgfqpoint{1.907cm}{0.968cm}}
\pgfpathcurveto{\pgfqpoint{1.933cm}{0.994cm}}{\pgfqpoint{1.948cm}{1.029cm}}{\pgfqpoint{1.948cm}{1.065cm}}
\pgfusepath{fill}
\begin{pgfscope}
\pgfsetdash{}{0cm}
\pgfsetlinewidth{0.818mm}
\pgfsetmiterlimit{7.0}
\pgfpathmoveto{\pgfqpoint{1.246cm}{0.315cm}}
\pgfpathlineto{\pgfqpoint{1.244cm}{1.061cm}}
\pgfusepath{stroke}
\end{pgfscope}
\pgfpathmoveto{\pgfqpoint{1.38cm}{1.065cm}}
\pgfpathcurveto{\pgfqpoint{1.38cm}{1.101cm}}{\pgfqpoint{1.366cm}{1.136cm}}{\pgfqpoint{1.34cm}{1.162cm}}
\pgfpathcurveto{\pgfqpoint{1.315cm}{1.187cm}}{\pgfqpoint{1.28cm}{1.202cm}}{\pgfqpoint{1.244cm}{1.202cm}}
\pgfpathcurveto{\pgfqpoint{1.207cm}{1.202cm}}{\pgfqpoint{1.173cm}{1.187cm}}{\pgfqpoint{1.147cm}{1.162cm}}
\pgfpathcurveto{\pgfqpoint{1.121cm}{1.136cm}}{\pgfqpoint{1.107cm}{1.101cm}}{\pgfqpoint{1.107cm}{1.065cm}}
\pgfpathcurveto{\pgfqpoint{1.107cm}{1.029cm}}{\pgfqpoint{1.121cm}{0.994cm}}{\pgfqpoint{1.147cm}{0.968cm}}
\pgfpathcurveto{\pgfqpoint{1.173cm}{0.942cm}}{\pgfqpoint{1.207cm}{0.928cm}}{\pgfqpoint{1.244cm}{0.928cm}}
\pgfpathcurveto{\pgfqpoint{1.28cm}{0.928cm}}{\pgfqpoint{1.315cm}{0.942cm}}{\pgfqpoint{1.34cm}{0.968cm}}
\pgfpathcurveto{\pgfqpoint{1.366cm}{0.994cm}}{\pgfqpoint{1.38cm}{1.029cm}}{\pgfqpoint{1.38cm}{1.065cm}}
\pgfusepath{fill}
\begin{pgfscope}
\pgfsetdash{}{0cm}
\pgfsetlinewidth{0.818mm}
\pgfsetmiterlimit{4.0}
\pgfpathmoveto{\pgfqpoint{1.383cm}{0.178cm}}
\pgfpathcurveto{\pgfqpoint{1.383cm}{0.214cm}}{\pgfqpoint{1.369cm}{0.249cm}}{\pgfqpoint{1.343cm}{0.275cm}}
\pgfpathcurveto{\pgfqpoint{1.317cm}{0.3cm}}{\pgfqpoint{1.283cm}{0.315cm}}{\pgfqpoint{1.246cm}{0.315cm}}
\pgfpathcurveto{\pgfqpoint{1.21cm}{0.315cm}}{\pgfqpoint{1.175cm}{0.3cm}}{\pgfqpoint{1.15cm}{0.275cm}}
\pgfpathcurveto{\pgfqpoint{1.124cm}{0.249cm}}{\pgfqpoint{1.11cm}{0.214cm}}{\pgfqpoint{1.11cm}{0.178cm}}
\pgfpathcurveto{\pgfqpoint{1.11cm}{0.141cm}}{\pgfqpoint{1.124cm}{0.107cm}}{\pgfqpoint{1.15cm}{0.081cm}}
\pgfpathcurveto{\pgfqpoint{1.175cm}{0.055cm}}{\pgfqpoint{1.21cm}{0.041cm}}{\pgfqpoint{1.246cm}{0.041cm}}
\pgfpathcurveto{\pgfqpoint{1.283cm}{0.041cm}}{\pgfqpoint{1.317cm}{0.055cm}}{\pgfqpoint{1.343cm}{0.081cm}}
\pgfpathcurveto{\pgfqpoint{1.369cm}{0.107cm}}{\pgfqpoint{1.383cm}{0.141cm}}{\pgfqpoint{1.383cm}{0.178cm}}
\pgfusepath{stroke}
\end{pgfscope}
\end{pgfscope}
\end{pgfscope}
\end{pgfscope}
\end{tikzpicture}}}$ & $ X^{\!\resizebox{!}{.8em}{
\begin{tikzpicture}
\pgfpathmoveto{\pgfqpoint{0cm}{-0.035cm}}
\pgfpathlineto{\pgfqpoint{1.976cm}{-0.035cm}}
\pgfpathlineto{\pgfqpoint{1.976cm}{1.94cm}}
\pgfpathlineto{\pgfqpoint{0cm}{1.94cm}}
\pgfpathclose
\pgfusepath{clip}
\begin{pgfscope}
\begin{pgfscope}
\pgfpathmoveto{\pgfqpoint{0cm}{-0.035cm}}
\pgfpathlineto{\pgfqpoint{1.976cm}{-0.035cm}}
\pgfpathlineto{\pgfqpoint{1.976cm}{1.94cm}}
\pgfpathlineto{\pgfqpoint{0cm}{1.94cm}}
\pgfpathclose
\pgfusepath{clip}
\begin{pgfscope}
\begin{pgfscope}
\pgfsetdash{}{0cm}
\pgfsetlinewidth{0.818mm}
\pgfsetroundcap
\pgfsetroundjoin
\pgfsetmiterlimit{7.0}
\definecolor{eps2pgf_color}{gray}{0}\pgfsetstrokecolor{eps2pgf_color}\pgfsetfillcolor{eps2pgf_color}
\pgfpathmoveto{\pgfqpoint{0.117cm}{1.815cm}}
\pgfpathlineto{\pgfqpoint{0.682cm}{1.065cm}}
\pgfpathlineto{\pgfqpoint{1.246cm}{1.815cm}}
\pgfusepath{stroke}
\end{pgfscope}
\definecolor{eps2pgf_color}{gray}{0}\pgfsetstrokecolor{eps2pgf_color}\pgfsetfillcolor{eps2pgf_color}
\pgfpathmoveto{\pgfqpoint{0.273cm}{1.789cm}}
\pgfpathcurveto{\pgfqpoint{0.273cm}{1.825cm}}{\pgfqpoint{0.259cm}{1.86cm}}{\pgfqpoint{0.233cm}{1.886cm}}
\pgfpathcurveto{\pgfqpoint{0.207cm}{1.912cm}}{\pgfqpoint{0.173cm}{1.926cm}}{\pgfqpoint{0.137cm}{1.926cm}}
\pgfpathcurveto{\pgfqpoint{0.1cm}{1.926cm}}{\pgfqpoint{0.066cm}{1.912cm}}{\pgfqpoint{0.04cm}{1.886cm}}
\pgfpathcurveto{\pgfqpoint{0.014cm}{1.86cm}}{\pgfqpoint{0cm}{1.825cm}}{\pgfqpoint{0cm}{1.789cm}}
\pgfpathcurveto{\pgfqpoint{0cm}{1.753cm}}{\pgfqpoint{0.014cm}{1.718cm}}{\pgfqpoint{0.04cm}{1.692cm}}
\pgfpathcurveto{\pgfqpoint{0.066cm}{1.667cm}}{\pgfqpoint{0.1cm}{1.652cm}}{\pgfqpoint{0.137cm}{1.652cm}}
\pgfpathcurveto{\pgfqpoint{0.173cm}{1.652cm}}{\pgfqpoint{0.207cm}{1.667cm}}{\pgfqpoint{0.233cm}{1.692cm}}
\pgfpathcurveto{\pgfqpoint{0.259cm}{1.718cm}}{\pgfqpoint{0.273cm}{1.753cm}}{\pgfqpoint{0.273cm}{1.789cm}}
\pgfusepath{fill}
\begin{pgfscope}
\pgfsetdash{}{0cm}
\pgfsetlinewidth{0.818mm}
\pgfsetmiterlimit{7.0}
\pgfpathmoveto{\pgfqpoint{0.682cm}{1.065cm}}
\pgfpathlineto{\pgfqpoint{0.679cm}{1.812cm}}
\pgfusepath{stroke}
\end{pgfscope}
\pgfpathmoveto{\pgfqpoint{0.815cm}{1.793cm}}
\pgfpathcurveto{\pgfqpoint{0.815cm}{1.829cm}}{\pgfqpoint{0.801cm}{1.864cm}}{\pgfqpoint{0.775cm}{1.89cm}}
\pgfpathcurveto{\pgfqpoint{0.75cm}{1.915cm}}{\pgfqpoint{0.715cm}{1.93cm}}{\pgfqpoint{0.679cm}{1.93cm}}
\pgfpathcurveto{\pgfqpoint{0.643cm}{1.93cm}}{\pgfqpoint{0.608cm}{1.915cm}}{\pgfqpoint{0.582cm}{1.89cm}}
\pgfpathcurveto{\pgfqpoint{0.557cm}{1.864cm}}{\pgfqpoint{0.542cm}{1.829cm}}{\pgfqpoint{0.542cm}{1.793cm}}
\pgfpathcurveto{\pgfqpoint{0.542cm}{1.756cm}}{\pgfqpoint{0.557cm}{1.722cm}}{\pgfqpoint{0.582cm}{1.696cm}}
\pgfpathcurveto{\pgfqpoint{0.608cm}{1.67cm}}{\pgfqpoint{0.643cm}{1.656cm}}{\pgfqpoint{0.679cm}{1.656cm}}
\pgfpathcurveto{\pgfqpoint{0.715cm}{1.656cm}}{\pgfqpoint{0.75cm}{1.67cm}}{\pgfqpoint{0.775cm}{1.696cm}}
\pgfpathcurveto{\pgfqpoint{0.801cm}{1.722cm}}{\pgfqpoint{0.815cm}{1.756cm}}{\pgfqpoint{0.815cm}{1.793cm}}
\pgfusepath{fill}
\pgfpathmoveto{\pgfqpoint{1.345cm}{1.765cm}}
\pgfpathcurveto{\pgfqpoint{1.345cm}{1.801cm}}{\pgfqpoint{1.331cm}{1.836cm}}{\pgfqpoint{1.305cm}{1.862cm}}
\pgfpathcurveto{\pgfqpoint{1.28cm}{1.887cm}}{\pgfqpoint{1.245cm}{1.902cm}}{\pgfqpoint{1.209cm}{1.902cm}}
\pgfpathcurveto{\pgfqpoint{1.172cm}{1.902cm}}{\pgfqpoint{1.138cm}{1.887cm}}{\pgfqpoint{1.112cm}{1.862cm}}
\pgfpathcurveto{\pgfqpoint{1.087cm}{1.836cm}}{\pgfqpoint{1.072cm}{1.801cm}}{\pgfqpoint{1.072cm}{1.765cm}}
\pgfpathcurveto{\pgfqpoint{1.072cm}{1.728cm}}{\pgfqpoint{1.087cm}{1.694cm}}{\pgfqpoint{1.112cm}{1.668cm}}
\pgfpathcurveto{\pgfqpoint{1.138cm}{1.642cm}}{\pgfqpoint{1.172cm}{1.628cm}}{\pgfqpoint{1.209cm}{1.628cm}}
\pgfpathcurveto{\pgfqpoint{1.245cm}{1.628cm}}{\pgfqpoint{1.28cm}{1.642cm}}{\pgfqpoint{1.305cm}{1.668cm}}
\pgfpathcurveto{\pgfqpoint{1.331cm}{1.694cm}}{\pgfqpoint{1.345cm}{1.728cm}}{\pgfqpoint{1.345cm}{1.765cm}}
\pgfusepath{fill}
\begin{pgfscope}
\pgfsetdash{}{0cm}
\pgfsetlinewidth{0.818mm}
\pgfsetroundcap
\pgfsetroundjoin
\pgfsetmiterlimit{7.0}
\pgfpathmoveto{\pgfqpoint{0.682cm}{1.065cm}}
\pgfpathlineto{\pgfqpoint{1.246cm}{0.315cm}}
\pgfpathlineto{\pgfqpoint{1.811cm}{1.065cm}}
\pgfusepath{stroke}
\end{pgfscope}
\pgfpathmoveto{\pgfqpoint{1.948cm}{1.065cm}}
\pgfpathcurveto{\pgfqpoint{1.948cm}{1.101cm}}{\pgfqpoint{1.933cm}{1.136cm}}{\pgfqpoint{1.907cm}{1.162cm}}
\pgfpathcurveto{\pgfqpoint{1.882cm}{1.187cm}}{\pgfqpoint{1.847cm}{1.202cm}}{\pgfqpoint{1.811cm}{1.202cm}}
\pgfpathcurveto{\pgfqpoint{1.775cm}{1.202cm}}{\pgfqpoint{1.74cm}{1.187cm}}{\pgfqpoint{1.714cm}{1.162cm}}
\pgfpathcurveto{\pgfqpoint{1.689cm}{1.136cm}}{\pgfqpoint{1.674cm}{1.101cm}}{\pgfqpoint{1.674cm}{1.065cm}}
\pgfpathcurveto{\pgfqpoint{1.674cm}{1.029cm}}{\pgfqpoint{1.689cm}{0.994cm}}{\pgfqpoint{1.714cm}{0.968cm}}
\pgfpathcurveto{\pgfqpoint{1.74cm}{0.942cm}}{\pgfqpoint{1.775cm}{0.928cm}}{\pgfqpoint{1.811cm}{0.928cm}}
\pgfpathcurveto{\pgfqpoint{1.847cm}{0.928cm}}{\pgfqpoint{1.882cm}{0.942cm}}{\pgfqpoint{1.907cm}{0.968cm}}
\pgfpathcurveto{\pgfqpoint{1.933cm}{0.994cm}}{\pgfqpoint{1.948cm}{1.029cm}}{\pgfqpoint{1.948cm}{1.065cm}}
\pgfusepath{fill}
\begin{pgfscope}
\pgfsetdash{}{0cm}
\pgfsetlinewidth{0.818mm}
\pgfsetmiterlimit{7.0}
\pgfpathmoveto{\pgfqpoint{1.246cm}{0.315cm}}
\pgfpathlineto{\pgfqpoint{1.244cm}{1.061cm}}
\pgfusepath{stroke}
\end{pgfscope}
\pgfpathmoveto{\pgfqpoint{1.38cm}{1.065cm}}
\pgfpathcurveto{\pgfqpoint{1.38cm}{1.101cm}}{\pgfqpoint{1.366cm}{1.136cm}}{\pgfqpoint{1.34cm}{1.162cm}}
\pgfpathcurveto{\pgfqpoint{1.315cm}{1.187cm}}{\pgfqpoint{1.28cm}{1.202cm}}{\pgfqpoint{1.244cm}{1.202cm}}
\pgfpathcurveto{\pgfqpoint{1.207cm}{1.202cm}}{\pgfqpoint{1.173cm}{1.187cm}}{\pgfqpoint{1.147cm}{1.162cm}}
\pgfpathcurveto{\pgfqpoint{1.121cm}{1.136cm}}{\pgfqpoint{1.107cm}{1.101cm}}{\pgfqpoint{1.107cm}{1.065cm}}
\pgfpathcurveto{\pgfqpoint{1.107cm}{1.029cm}}{\pgfqpoint{1.121cm}{0.994cm}}{\pgfqpoint{1.147cm}{0.968cm}}
\pgfpathcurveto{\pgfqpoint{1.173cm}{0.942cm}}{\pgfqpoint{1.207cm}{0.928cm}}{\pgfqpoint{1.244cm}{0.928cm}}
\pgfpathcurveto{\pgfqpoint{1.28cm}{0.928cm}}{\pgfqpoint{1.315cm}{0.942cm}}{\pgfqpoint{1.34cm}{0.968cm}}
\pgfpathcurveto{\pgfqpoint{1.366cm}{0.994cm}}{\pgfqpoint{1.38cm}{1.029cm}}{\pgfqpoint{1.38cm}{1.065cm}}
\pgfusepath{fill}
\begin{pgfscope}
\pgfsetdash{}{0cm}
\pgfsetlinewidth{0.818mm}
\pgfsetmiterlimit{4.0}
\pgfpathmoveto{\pgfqpoint{1.383cm}{0.178cm}}
\pgfpathcurveto{\pgfqpoint{1.383cm}{0.214cm}}{\pgfqpoint{1.369cm}{0.249cm}}{\pgfqpoint{1.343cm}{0.275cm}}
\pgfpathcurveto{\pgfqpoint{1.317cm}{0.3cm}}{\pgfqpoint{1.283cm}{0.315cm}}{\pgfqpoint{1.246cm}{0.315cm}}
\pgfpathcurveto{\pgfqpoint{1.21cm}{0.315cm}}{\pgfqpoint{1.175cm}{0.3cm}}{\pgfqpoint{1.15cm}{0.275cm}}
\pgfpathcurveto{\pgfqpoint{1.124cm}{0.249cm}}{\pgfqpoint{1.11cm}{0.214cm}}{\pgfqpoint{1.11cm}{0.178cm}}
\pgfpathcurveto{\pgfqpoint{1.11cm}{0.141cm}}{\pgfqpoint{1.124cm}{0.107cm}}{\pgfqpoint{1.15cm}{0.081cm}}
\pgfpathcurveto{\pgfqpoint{1.175cm}{0.055cm}}{\pgfqpoint{1.21cm}{0.041cm}}{\pgfqpoint{1.246cm}{0.041cm}}
\pgfpathcurveto{\pgfqpoint{1.283cm}{0.041cm}}{\pgfqpoint{1.317cm}{0.055cm}}{\pgfqpoint{1.343cm}{0.081cm}}
\pgfpathcurveto{\pgfqpoint{1.369cm}{0.107cm}}{\pgfqpoint{1.383cm}{0.141cm}}{\pgfqpoint{1.383cm}{0.178cm}}
\pgfusepath{stroke}
\end{pgfscope}
\end{pgfscope}
\end{pgfscope}
\end{pgfscope}
\end{tikzpicture}}}$\\ \hline 
  $\alpha_{\tau}$ & $-\frac{1}{2}-\kappa$ & $-1-\kappa$ & $-\frac32-\kappa $& $\frac{1}{2}-\kappa$ & $1-\kappa$ & $-\kappa$ & $-\kappa$ & $-\frac{1}{2}-\kappa$ \\
  \hline
  \end{tabular}
  \caption{Space regularity of stochastic objects in the $d=5$ elliptic or $d=3$ parabolic case.}
  \label{t:reg}
  \end{center}
  \end{table}
 
 \begin{theorem}\label{thm:renorm2}
Let $d=5$. Let $\rho(x)=\langle x\rangle^{-\nu}$ for some $\nu>0$. Then there exist random distributions
\begin{equation}\label{eq:r45}
X,\llbracket X^{2}\rrbracket,\llbracket X^{3}\rrbracket, X^{\!\resizebox{0.6em}{!}{
\begin{tikzpicture}
\pgfpathmoveto{\pgfqpoint{0cm}{-0.035cm}}
\pgfpathlineto{\pgfqpoint{1.376cm}{-0.035cm}}
\pgfpathlineto{\pgfqpoint{1.376cm}{1.552cm}}
\pgfpathlineto{\pgfqpoint{0cm}{1.552cm}}
\pgfpathclose
\pgfusepath{clip}
\begin{pgfscope}
\begin{pgfscope}
\pgfpathmoveto{\pgfqpoint{0cm}{-0.035cm}}
\pgfpathlineto{\pgfqpoint{1.376cm}{-0.035cm}}
\pgfpathlineto{\pgfqpoint{1.376cm}{1.552cm}}
\pgfpathlineto{\pgfqpoint{0cm}{1.552cm}}
\pgfpathclose
\pgfusepath{clip}
\begin{pgfscope}
\begin{pgfscope}
\pgfsetdash{}{0cm}
\pgfsetlinewidth{0.818mm}
\pgfsetroundcap
\pgfsetroundjoin
\pgfsetmiterlimit{7.0}
\definecolor{eps2pgf_color}{gray}{0}\pgfsetstrokecolor{eps2pgf_color}\pgfsetfillcolor{eps2pgf_color}
\pgfpathmoveto{\pgfqpoint{0.117cm}{1.421cm}}
\pgfpathlineto{\pgfqpoint{0.682cm}{0.671cm}}
\pgfpathlineto{\pgfqpoint{1.246cm}{1.421cm}}
\pgfusepath{stroke}
\end{pgfscope}
\definecolor{eps2pgf_color}{gray}{0}\pgfsetstrokecolor{eps2pgf_color}\pgfsetfillcolor{eps2pgf_color}
\pgfpathmoveto{\pgfqpoint{0.273cm}{1.395cm}}
\pgfpathcurveto{\pgfqpoint{0.273cm}{1.432cm}}{\pgfqpoint{0.259cm}{1.467cm}}{\pgfqpoint{0.233cm}{1.492cm}}
\pgfpathcurveto{\pgfqpoint{0.207cm}{1.518cm}}{\pgfqpoint{0.173cm}{1.532cm}}{\pgfqpoint{0.137cm}{1.532cm}}
\pgfpathcurveto{\pgfqpoint{0.1cm}{1.532cm}}{\pgfqpoint{0.066cm}{1.518cm}}{\pgfqpoint{0.04cm}{1.492cm}}
\pgfpathcurveto{\pgfqpoint{0.014cm}{1.467cm}}{\pgfqpoint{0cm}{1.432cm}}{\pgfqpoint{0cm}{1.395cm}}
\pgfpathcurveto{\pgfqpoint{0cm}{1.359cm}}{\pgfqpoint{0.014cm}{1.324cm}}{\pgfqpoint{0.04cm}{1.299cm}}
\pgfpathcurveto{\pgfqpoint{0.066cm}{1.273cm}}{\pgfqpoint{0.1cm}{1.258cm}}{\pgfqpoint{0.137cm}{1.258cm}}
\pgfpathcurveto{\pgfqpoint{0.173cm}{1.258cm}}{\pgfqpoint{0.207cm}{1.273cm}}{\pgfqpoint{0.233cm}{1.299cm}}
\pgfpathcurveto{\pgfqpoint{0.259cm}{1.324cm}}{\pgfqpoint{0.273cm}{1.359cm}}{\pgfqpoint{0.273cm}{1.395cm}}
\pgfusepath{fill}
\begin{pgfscope}
\pgfsetdash{}{0cm}
\pgfsetlinewidth{0.818mm}
\pgfsetmiterlimit{7.0}
\pgfpathmoveto{\pgfqpoint{0.682cm}{0.671cm}}
\pgfpathlineto{\pgfqpoint{0.679cm}{1.418cm}}
\pgfusepath{stroke}
\end{pgfscope}
\pgfpathmoveto{\pgfqpoint{0.815cm}{1.399cm}}
\pgfpathcurveto{\pgfqpoint{0.815cm}{1.435cm}}{\pgfqpoint{0.801cm}{1.47cm}}{\pgfqpoint{0.775cm}{1.496cm}}
\pgfpathcurveto{\pgfqpoint{0.75cm}{1.521cm}}{\pgfqpoint{0.715cm}{1.536cm}}{\pgfqpoint{0.679cm}{1.536cm}}
\pgfpathcurveto{\pgfqpoint{0.643cm}{1.536cm}}{\pgfqpoint{0.608cm}{1.521cm}}{\pgfqpoint{0.582cm}{1.496cm}}
\pgfpathcurveto{\pgfqpoint{0.557cm}{1.47cm}}{\pgfqpoint{0.542cm}{1.435cm}}{\pgfqpoint{0.542cm}{1.399cm}}
\pgfpathcurveto{\pgfqpoint{0.542cm}{1.363cm}}{\pgfqpoint{0.557cm}{1.328cm}}{\pgfqpoint{0.582cm}{1.302cm}}
\pgfpathcurveto{\pgfqpoint{0.608cm}{1.276cm}}{\pgfqpoint{0.643cm}{1.262cm}}{\pgfqpoint{0.679cm}{1.262cm}}
\pgfpathcurveto{\pgfqpoint{0.715cm}{1.262cm}}{\pgfqpoint{0.75cm}{1.276cm}}{\pgfqpoint{0.775cm}{1.302cm}}
\pgfpathcurveto{\pgfqpoint{0.801cm}{1.328cm}}{\pgfqpoint{0.815cm}{1.363cm}}{\pgfqpoint{0.815cm}{1.399cm}}
\pgfusepath{fill}
\pgfpathmoveto{\pgfqpoint{1.345cm}{1.371cm}}
\pgfpathcurveto{\pgfqpoint{1.345cm}{1.408cm}}{\pgfqpoint{1.331cm}{1.442cm}}{\pgfqpoint{1.305cm}{1.468cm}}
\pgfpathcurveto{\pgfqpoint{1.28cm}{1.494cm}}{\pgfqpoint{1.245cm}{1.508cm}}{\pgfqpoint{1.209cm}{1.508cm}}
\pgfpathcurveto{\pgfqpoint{1.172cm}{1.508cm}}{\pgfqpoint{1.138cm}{1.494cm}}{\pgfqpoint{1.112cm}{1.468cm}}
\pgfpathcurveto{\pgfqpoint{1.087cm}{1.442cm}}{\pgfqpoint{1.072cm}{1.408cm}}{\pgfqpoint{1.072cm}{1.371cm}}
\pgfpathcurveto{\pgfqpoint{1.072cm}{1.335cm}}{\pgfqpoint{1.087cm}{1.3cm}}{\pgfqpoint{1.112cm}{1.274cm}}
\pgfpathcurveto{\pgfqpoint{1.138cm}{1.249cm}}{\pgfqpoint{1.172cm}{1.234cm}}{\pgfqpoint{1.209cm}{1.234cm}}
\pgfpathcurveto{\pgfqpoint{1.245cm}{1.234cm}}{\pgfqpoint{1.28cm}{1.249cm}}{\pgfqpoint{1.305cm}{1.274cm}}
\pgfpathcurveto{\pgfqpoint{1.331cm}{1.3cm}}{\pgfqpoint{1.345cm}{1.335cm}}{\pgfqpoint{1.345cm}{1.371cm}}
\pgfusepath{fill}
\begin{pgfscope}
\pgfsetdash{}{0cm}
\pgfsetlinewidth{0.818mm}
\pgfsetroundcap
\pgfsetmiterlimit{4.0}
\pgfpathmoveto{\pgfqpoint{0.682cm}{0.671cm}}
\pgfpathlineto{\pgfqpoint{0.682cm}{0.042cm}}
\pgfusepath{stroke}
\end{pgfscope}
\end{pgfscope}
\end{pgfscope}
\end{pgfscope}
\end{tikzpicture}}}, X^{\!\resizebox{0.6em}{!}{
\begin{tikzpicture}
\pgfpathmoveto{\pgfqpoint{0cm}{0cm}}
\pgfpathlineto{\pgfqpoint{1.376cm}{0cm}}
\pgfpathlineto{\pgfqpoint{1.376cm}{1.588cm}}
\pgfpathlineto{\pgfqpoint{0cm}{1.588cm}}
\pgfpathclose
\pgfusepath{clip}
\begin{pgfscope}
\begin{pgfscope}
\pgfpathmoveto{\pgfqpoint{0cm}{0cm}}
\pgfpathlineto{\pgfqpoint{1.376cm}{0cm}}
\pgfpathlineto{\pgfqpoint{1.376cm}{1.588cm}}
\pgfpathlineto{\pgfqpoint{0cm}{1.588cm}}
\pgfpathclose
\pgfusepath{clip}
\begin{pgfscope}
\begin{pgfscope}
\definecolor{eps2pgf_color}{gray}{0.976471}\pgfsetstrokecolor{eps2pgf_color}\pgfsetfillcolor{eps2pgf_color}
\pgfpathmoveto{\pgfqpoint{0cm}{0cm}}
\pgfpathlineto{\pgfqpoint{1.376cm}{0cm}}
\pgfpathlineto{\pgfqpoint{1.376cm}{1.588cm}}
\pgfpathlineto{\pgfqpoint{0cm}{1.588cm}}
\pgfpathclose
\pgfusepath{fill}
\end{pgfscope}
\begin{pgfscope}
\pgfsetdash{}{0cm}
\pgfsetlinewidth{0.818mm}
\pgfsetroundcap
\pgfsetroundjoin
\pgfsetmiterlimit{7.0}
\definecolor{eps2pgf_color}{gray}{0}\pgfsetstrokecolor{eps2pgf_color}\pgfsetfillcolor{eps2pgf_color}
\pgfpathmoveto{\pgfqpoint{0.117cm}{1.476cm}}
\pgfpathlineto{\pgfqpoint{0.682cm}{0.726cm}}
\pgfpathlineto{\pgfqpoint{1.246cm}{1.476cm}}
\pgfusepath{stroke}
\end{pgfscope}
\definecolor{eps2pgf_color}{gray}{0}\pgfsetstrokecolor{eps2pgf_color}\pgfsetfillcolor{eps2pgf_color}
\pgfpathmoveto{\pgfqpoint{0.273cm}{1.451cm}}
\pgfpathcurveto{\pgfqpoint{0.273cm}{1.487cm}}{\pgfqpoint{0.259cm}{1.522cm}}{\pgfqpoint{0.233cm}{1.547cm}}
\pgfpathcurveto{\pgfqpoint{0.207cm}{1.573cm}}{\pgfqpoint{0.173cm}{1.588cm}}{\pgfqpoint{0.137cm}{1.588cm}}
\pgfpathcurveto{\pgfqpoint{0.1cm}{1.588cm}}{\pgfqpoint{0.066cm}{1.573cm}}{\pgfqpoint{0.04cm}{1.547cm}}
\pgfpathcurveto{\pgfqpoint{0.014cm}{1.522cm}}{\pgfqpoint{0cm}{1.487cm}}{\pgfqpoint{0cm}{1.451cm}}
\pgfpathcurveto{\pgfqpoint{0cm}{1.414cm}}{\pgfqpoint{0.014cm}{1.379cm}}{\pgfqpoint{0.04cm}{1.354cm}}
\pgfpathcurveto{\pgfqpoint{0.066cm}{1.328cm}}{\pgfqpoint{0.1cm}{1.314cm}}{\pgfqpoint{0.137cm}{1.314cm}}
\pgfpathcurveto{\pgfqpoint{0.173cm}{1.314cm}}{\pgfqpoint{0.207cm}{1.328cm}}{\pgfqpoint{0.233cm}{1.354cm}}
\pgfpathcurveto{\pgfqpoint{0.259cm}{1.379cm}}{\pgfqpoint{0.273cm}{1.414cm}}{\pgfqpoint{0.273cm}{1.451cm}}
\pgfusepath{fill}
\pgfpathmoveto{\pgfqpoint{1.345cm}{1.426cm}}
\pgfpathcurveto{\pgfqpoint{1.345cm}{1.463cm}}{\pgfqpoint{1.331cm}{1.497cm}}{\pgfqpoint{1.305cm}{1.523cm}}
\pgfpathcurveto{\pgfqpoint{1.28cm}{1.549cm}}{\pgfqpoint{1.245cm}{1.563cm}}{\pgfqpoint{1.209cm}{1.563cm}}
\pgfpathcurveto{\pgfqpoint{1.172cm}{1.563cm}}{\pgfqpoint{1.138cm}{1.549cm}}{\pgfqpoint{1.112cm}{1.523cm}}
\pgfpathcurveto{\pgfqpoint{1.087cm}{1.497cm}}{\pgfqpoint{1.072cm}{1.463cm}}{\pgfqpoint{1.072cm}{1.426cm}}
\pgfpathcurveto{\pgfqpoint{1.072cm}{1.39cm}}{\pgfqpoint{1.087cm}{1.355cm}}{\pgfqpoint{1.112cm}{1.329cm}}
\pgfpathcurveto{\pgfqpoint{1.138cm}{1.304cm}}{\pgfqpoint{1.172cm}{1.289cm}}{\pgfqpoint{1.209cm}{1.289cm}}
\pgfpathcurveto{\pgfqpoint{1.245cm}{1.289cm}}{\pgfqpoint{1.28cm}{1.304cm}}{\pgfqpoint{1.305cm}{1.329cm}}
\pgfpathcurveto{\pgfqpoint{1.331cm}{1.355cm}}{\pgfqpoint{1.345cm}{1.39cm}}{\pgfqpoint{1.345cm}{1.426cm}}
\pgfusepath{fill}
\begin{pgfscope}
\pgfsetdash{}{0cm}
\pgfsetlinewidth{0.818mm}
\pgfsetroundcap
\pgfsetmiterlimit{4.0}
\pgfpathmoveto{\pgfqpoint{0.682cm}{0.726cm}}
\pgfpathlineto{\pgfqpoint{0.682cm}{0.097cm}}
\pgfusepath{stroke}
\end{pgfscope}
\end{pgfscope}
\end{pgfscope}
\end{pgfscope}
\end{tikzpicture}}}, X^{\!\resizebox{!}{.8em}{
\begin{tikzpicture}
\pgfpathmoveto{\pgfqpoint{0cm}{-0.035cm}}
\pgfpathlineto{\pgfqpoint{1.976cm}{-0.035cm}}
\pgfpathlineto{\pgfqpoint{1.976cm}{1.94cm}}
\pgfpathlineto{\pgfqpoint{0cm}{1.94cm}}
\pgfpathclose
\pgfusepath{clip}
\begin{pgfscope}
\begin{pgfscope}
\pgfpathmoveto{\pgfqpoint{0cm}{-0.035cm}}
\pgfpathlineto{\pgfqpoint{1.976cm}{-0.035cm}}
\pgfpathlineto{\pgfqpoint{1.976cm}{1.94cm}}
\pgfpathlineto{\pgfqpoint{0cm}{1.94cm}}
\pgfpathclose
\pgfusepath{clip}
\begin{pgfscope}
\begin{pgfscope}
\pgfsetdash{}{0cm}
\pgfsetlinewidth{0.818mm}
\pgfsetroundcap
\pgfsetroundjoin
\pgfsetmiterlimit{7.0}
\definecolor{eps2pgf_color}{gray}{0}\pgfsetstrokecolor{eps2pgf_color}\pgfsetfillcolor{eps2pgf_color}
\pgfpathmoveto{\pgfqpoint{0.117cm}{1.815cm}}
\pgfpathlineto{\pgfqpoint{0.682cm}{1.065cm}}
\pgfpathlineto{\pgfqpoint{1.246cm}{1.815cm}}
\pgfusepath{stroke}
\end{pgfscope}
\definecolor{eps2pgf_color}{gray}{0}\pgfsetstrokecolor{eps2pgf_color}\pgfsetfillcolor{eps2pgf_color}
\pgfpathmoveto{\pgfqpoint{0.273cm}{1.789cm}}
\pgfpathcurveto{\pgfqpoint{0.273cm}{1.825cm}}{\pgfqpoint{0.259cm}{1.86cm}}{\pgfqpoint{0.233cm}{1.886cm}}
\pgfpathcurveto{\pgfqpoint{0.207cm}{1.912cm}}{\pgfqpoint{0.173cm}{1.926cm}}{\pgfqpoint{0.137cm}{1.926cm}}
\pgfpathcurveto{\pgfqpoint{0.1cm}{1.926cm}}{\pgfqpoint{0.066cm}{1.912cm}}{\pgfqpoint{0.04cm}{1.886cm}}
\pgfpathcurveto{\pgfqpoint{0.014cm}{1.86cm}}{\pgfqpoint{0cm}{1.825cm}}{\pgfqpoint{0cm}{1.789cm}}
\pgfpathcurveto{\pgfqpoint{0cm}{1.753cm}}{\pgfqpoint{0.014cm}{1.718cm}}{\pgfqpoint{0.04cm}{1.692cm}}
\pgfpathcurveto{\pgfqpoint{0.066cm}{1.667cm}}{\pgfqpoint{0.1cm}{1.652cm}}{\pgfqpoint{0.137cm}{1.652cm}}
\pgfpathcurveto{\pgfqpoint{0.173cm}{1.652cm}}{\pgfqpoint{0.207cm}{1.667cm}}{\pgfqpoint{0.233cm}{1.692cm}}
\pgfpathcurveto{\pgfqpoint{0.259cm}{1.718cm}}{\pgfqpoint{0.273cm}{1.753cm}}{\pgfqpoint{0.273cm}{1.789cm}}
\pgfusepath{fill}
\begin{pgfscope}
\pgfsetdash{}{0cm}
\pgfsetlinewidth{0.818mm}
\pgfsetmiterlimit{7.0}
\pgfpathmoveto{\pgfqpoint{0.682cm}{1.065cm}}
\pgfpathlineto{\pgfqpoint{0.679cm}{1.812cm}}
\pgfusepath{stroke}
\end{pgfscope}
\pgfpathmoveto{\pgfqpoint{0.815cm}{1.793cm}}
\pgfpathcurveto{\pgfqpoint{0.815cm}{1.829cm}}{\pgfqpoint{0.801cm}{1.864cm}}{\pgfqpoint{0.775cm}{1.89cm}}
\pgfpathcurveto{\pgfqpoint{0.75cm}{1.915cm}}{\pgfqpoint{0.715cm}{1.93cm}}{\pgfqpoint{0.679cm}{1.93cm}}
\pgfpathcurveto{\pgfqpoint{0.643cm}{1.93cm}}{\pgfqpoint{0.608cm}{1.915cm}}{\pgfqpoint{0.582cm}{1.89cm}}
\pgfpathcurveto{\pgfqpoint{0.557cm}{1.864cm}}{\pgfqpoint{0.542cm}{1.829cm}}{\pgfqpoint{0.542cm}{1.793cm}}
\pgfpathcurveto{\pgfqpoint{0.542cm}{1.756cm}}{\pgfqpoint{0.557cm}{1.722cm}}{\pgfqpoint{0.582cm}{1.696cm}}
\pgfpathcurveto{\pgfqpoint{0.608cm}{1.67cm}}{\pgfqpoint{0.643cm}{1.656cm}}{\pgfqpoint{0.679cm}{1.656cm}}
\pgfpathcurveto{\pgfqpoint{0.715cm}{1.656cm}}{\pgfqpoint{0.75cm}{1.67cm}}{\pgfqpoint{0.775cm}{1.696cm}}
\pgfpathcurveto{\pgfqpoint{0.801cm}{1.722cm}}{\pgfqpoint{0.815cm}{1.756cm}}{\pgfqpoint{0.815cm}{1.793cm}}
\pgfusepath{fill}
\pgfpathmoveto{\pgfqpoint{1.345cm}{1.765cm}}
\pgfpathcurveto{\pgfqpoint{1.345cm}{1.801cm}}{\pgfqpoint{1.331cm}{1.836cm}}{\pgfqpoint{1.305cm}{1.862cm}}
\pgfpathcurveto{\pgfqpoint{1.28cm}{1.887cm}}{\pgfqpoint{1.245cm}{1.902cm}}{\pgfqpoint{1.209cm}{1.902cm}}
\pgfpathcurveto{\pgfqpoint{1.172cm}{1.902cm}}{\pgfqpoint{1.138cm}{1.887cm}}{\pgfqpoint{1.112cm}{1.862cm}}
\pgfpathcurveto{\pgfqpoint{1.087cm}{1.836cm}}{\pgfqpoint{1.072cm}{1.801cm}}{\pgfqpoint{1.072cm}{1.765cm}}
\pgfpathcurveto{\pgfqpoint{1.072cm}{1.728cm}}{\pgfqpoint{1.087cm}{1.694cm}}{\pgfqpoint{1.112cm}{1.668cm}}
\pgfpathcurveto{\pgfqpoint{1.138cm}{1.642cm}}{\pgfqpoint{1.172cm}{1.628cm}}{\pgfqpoint{1.209cm}{1.628cm}}
\pgfpathcurveto{\pgfqpoint{1.245cm}{1.628cm}}{\pgfqpoint{1.28cm}{1.642cm}}{\pgfqpoint{1.305cm}{1.668cm}}
\pgfpathcurveto{\pgfqpoint{1.331cm}{1.694cm}}{\pgfqpoint{1.345cm}{1.728cm}}{\pgfqpoint{1.345cm}{1.765cm}}
\pgfusepath{fill}
\begin{pgfscope}
\pgfsetdash{}{0cm}
\pgfsetlinewidth{0.818mm}
\pgfsetroundcap
\pgfsetroundjoin
\pgfsetmiterlimit{7.0}
\pgfpathmoveto{\pgfqpoint{0.682cm}{1.065cm}}
\pgfpathlineto{\pgfqpoint{1.246cm}{0.315cm}}
\pgfpathlineto{\pgfqpoint{1.811cm}{1.065cm}}
\pgfusepath{stroke}
\end{pgfscope}
\pgfpathmoveto{\pgfqpoint{1.948cm}{1.065cm}}
\pgfpathcurveto{\pgfqpoint{1.948cm}{1.101cm}}{\pgfqpoint{1.933cm}{1.136cm}}{\pgfqpoint{1.907cm}{1.162cm}}
\pgfpathcurveto{\pgfqpoint{1.882cm}{1.187cm}}{\pgfqpoint{1.847cm}{1.202cm}}{\pgfqpoint{1.811cm}{1.202cm}}
\pgfpathcurveto{\pgfqpoint{1.775cm}{1.202cm}}{\pgfqpoint{1.74cm}{1.187cm}}{\pgfqpoint{1.714cm}{1.162cm}}
\pgfpathcurveto{\pgfqpoint{1.689cm}{1.136cm}}{\pgfqpoint{1.674cm}{1.101cm}}{\pgfqpoint{1.674cm}{1.065cm}}
\pgfpathcurveto{\pgfqpoint{1.674cm}{1.029cm}}{\pgfqpoint{1.689cm}{0.994cm}}{\pgfqpoint{1.714cm}{0.968cm}}
\pgfpathcurveto{\pgfqpoint{1.74cm}{0.942cm}}{\pgfqpoint{1.775cm}{0.928cm}}{\pgfqpoint{1.811cm}{0.928cm}}
\pgfpathcurveto{\pgfqpoint{1.847cm}{0.928cm}}{\pgfqpoint{1.882cm}{0.942cm}}{\pgfqpoint{1.907cm}{0.968cm}}
\pgfpathcurveto{\pgfqpoint{1.933cm}{0.994cm}}{\pgfqpoint{1.948cm}{1.029cm}}{\pgfqpoint{1.948cm}{1.065cm}}
\pgfusepath{fill}
\begin{pgfscope}
\pgfsetdash{}{0cm}
\pgfsetlinewidth{0.818mm}
\pgfsetmiterlimit{4.0}
\pgfpathmoveto{\pgfqpoint{1.383cm}{0.178cm}}
\pgfpathcurveto{\pgfqpoint{1.383cm}{0.214cm}}{\pgfqpoint{1.369cm}{0.249cm}}{\pgfqpoint{1.343cm}{0.275cm}}
\pgfpathcurveto{\pgfqpoint{1.317cm}{0.3cm}}{\pgfqpoint{1.283cm}{0.315cm}}{\pgfqpoint{1.246cm}{0.315cm}}
\pgfpathcurveto{\pgfqpoint{1.21cm}{0.315cm}}{\pgfqpoint{1.175cm}{0.3cm}}{\pgfqpoint{1.15cm}{0.275cm}}
\pgfpathcurveto{\pgfqpoint{1.124cm}{0.249cm}}{\pgfqpoint{1.11cm}{0.214cm}}{\pgfqpoint{1.11cm}{0.178cm}}
\pgfpathcurveto{\pgfqpoint{1.11cm}{0.141cm}}{\pgfqpoint{1.124cm}{0.107cm}}{\pgfqpoint{1.15cm}{0.081cm}}
\pgfpathcurveto{\pgfqpoint{1.175cm}{0.055cm}}{\pgfqpoint{1.21cm}{0.041cm}}{\pgfqpoint{1.246cm}{0.041cm}}
\pgfpathcurveto{\pgfqpoint{1.283cm}{0.041cm}}{\pgfqpoint{1.317cm}{0.055cm}}{\pgfqpoint{1.343cm}{0.081cm}}
\pgfpathcurveto{\pgfqpoint{1.369cm}{0.107cm}}{\pgfqpoint{1.383cm}{0.141cm}}{\pgfqpoint{1.383cm}{0.178cm}}
\pgfusepath{stroke}
\end{pgfscope}
\end{pgfscope}
\end{pgfscope}
\end{pgfscope}
\end{tikzpicture}}}, X^{\!\resizebox{!}{.8em}{
\begin{tikzpicture}
\pgfpathmoveto{\pgfqpoint{0cm}{-0.035cm}}
\pgfpathlineto{\pgfqpoint{1.976cm}{-0.035cm}}
\pgfpathlineto{\pgfqpoint{1.976cm}{1.94cm}}
\pgfpathlineto{\pgfqpoint{0cm}{1.94cm}}
\pgfpathclose
\pgfusepath{clip}
\begin{pgfscope}
\begin{pgfscope}
\pgfpathmoveto{\pgfqpoint{0cm}{-0.035cm}}
\pgfpathlineto{\pgfqpoint{1.976cm}{-0.035cm}}
\pgfpathlineto{\pgfqpoint{1.976cm}{1.94cm}}
\pgfpathlineto{\pgfqpoint{0cm}{1.94cm}}
\pgfpathclose
\pgfusepath{clip}
\begin{pgfscope}
\begin{pgfscope}
\pgfsetdash{}{0cm}
\pgfsetlinewidth{0.818mm}
\pgfsetroundcap
\pgfsetroundjoin
\pgfsetmiterlimit{7.0}
\definecolor{eps2pgf_color}{gray}{0}\pgfsetstrokecolor{eps2pgf_color}\pgfsetfillcolor{eps2pgf_color}
\pgfpathmoveto{\pgfqpoint{0.117cm}{1.815cm}}
\pgfpathlineto{\pgfqpoint{0.682cm}{1.065cm}}
\pgfpathlineto{\pgfqpoint{1.246cm}{1.815cm}}
\pgfusepath{stroke}
\end{pgfscope}
\definecolor{eps2pgf_color}{gray}{0}\pgfsetstrokecolor{eps2pgf_color}\pgfsetfillcolor{eps2pgf_color}
\pgfpathmoveto{\pgfqpoint{0.273cm}{1.789cm}}
\pgfpathcurveto{\pgfqpoint{0.273cm}{1.825cm}}{\pgfqpoint{0.259cm}{1.86cm}}{\pgfqpoint{0.233cm}{1.886cm}}
\pgfpathcurveto{\pgfqpoint{0.207cm}{1.912cm}}{\pgfqpoint{0.173cm}{1.926cm}}{\pgfqpoint{0.137cm}{1.926cm}}
\pgfpathcurveto{\pgfqpoint{0.1cm}{1.926cm}}{\pgfqpoint{0.066cm}{1.912cm}}{\pgfqpoint{0.04cm}{1.886cm}}
\pgfpathcurveto{\pgfqpoint{0.014cm}{1.86cm}}{\pgfqpoint{0cm}{1.825cm}}{\pgfqpoint{0cm}{1.789cm}}
\pgfpathcurveto{\pgfqpoint{0cm}{1.753cm}}{\pgfqpoint{0.014cm}{1.718cm}}{\pgfqpoint{0.04cm}{1.692cm}}
\pgfpathcurveto{\pgfqpoint{0.066cm}{1.667cm}}{\pgfqpoint{0.1cm}{1.652cm}}{\pgfqpoint{0.137cm}{1.652cm}}
\pgfpathcurveto{\pgfqpoint{0.173cm}{1.652cm}}{\pgfqpoint{0.207cm}{1.667cm}}{\pgfqpoint{0.233cm}{1.692cm}}
\pgfpathcurveto{\pgfqpoint{0.259cm}{1.718cm}}{\pgfqpoint{0.273cm}{1.753cm}}{\pgfqpoint{0.273cm}{1.789cm}}
\pgfusepath{fill}
\pgfpathmoveto{\pgfqpoint{1.345cm}{1.765cm}}
\pgfpathcurveto{\pgfqpoint{1.345cm}{1.801cm}}{\pgfqpoint{1.331cm}{1.836cm}}{\pgfqpoint{1.305cm}{1.862cm}}
\pgfpathcurveto{\pgfqpoint{1.28cm}{1.887cm}}{\pgfqpoint{1.245cm}{1.902cm}}{\pgfqpoint{1.209cm}{1.902cm}}
\pgfpathcurveto{\pgfqpoint{1.172cm}{1.902cm}}{\pgfqpoint{1.138cm}{1.887cm}}{\pgfqpoint{1.112cm}{1.862cm}}
\pgfpathcurveto{\pgfqpoint{1.087cm}{1.836cm}}{\pgfqpoint{1.072cm}{1.801cm}}{\pgfqpoint{1.072cm}{1.765cm}}
\pgfpathcurveto{\pgfqpoint{1.072cm}{1.728cm}}{\pgfqpoint{1.087cm}{1.694cm}}{\pgfqpoint{1.112cm}{1.668cm}}
\pgfpathcurveto{\pgfqpoint{1.138cm}{1.642cm}}{\pgfqpoint{1.172cm}{1.628cm}}{\pgfqpoint{1.209cm}{1.628cm}}
\pgfpathcurveto{\pgfqpoint{1.245cm}{1.628cm}}{\pgfqpoint{1.28cm}{1.642cm}}{\pgfqpoint{1.305cm}{1.668cm}}
\pgfpathcurveto{\pgfqpoint{1.331cm}{1.694cm}}{\pgfqpoint{1.345cm}{1.728cm}}{\pgfqpoint{1.345cm}{1.765cm}}
\pgfusepath{fill}
\begin{pgfscope}
\pgfsetdash{}{0cm}
\pgfsetlinewidth{0.818mm}
\pgfsetroundcap
\pgfsetroundjoin
\pgfsetmiterlimit{7.0}
\pgfpathmoveto{\pgfqpoint{0.682cm}{1.065cm}}
\pgfpathlineto{\pgfqpoint{1.246cm}{0.315cm}}
\pgfpathlineto{\pgfqpoint{1.811cm}{1.065cm}}
\pgfusepath{stroke}
\end{pgfscope}
\pgfpathmoveto{\pgfqpoint{1.948cm}{1.065cm}}
\pgfpathcurveto{\pgfqpoint{1.948cm}{1.101cm}}{\pgfqpoint{1.933cm}{1.136cm}}{\pgfqpoint{1.907cm}{1.162cm}}
\pgfpathcurveto{\pgfqpoint{1.882cm}{1.187cm}}{\pgfqpoint{1.847cm}{1.202cm}}{\pgfqpoint{1.811cm}{1.202cm}}
\pgfpathcurveto{\pgfqpoint{1.775cm}{1.202cm}}{\pgfqpoint{1.74cm}{1.187cm}}{\pgfqpoint{1.714cm}{1.162cm}}
\pgfpathcurveto{\pgfqpoint{1.689cm}{1.136cm}}{\pgfqpoint{1.674cm}{1.101cm}}{\pgfqpoint{1.674cm}{1.065cm}}
\pgfpathcurveto{\pgfqpoint{1.674cm}{1.029cm}}{\pgfqpoint{1.689cm}{0.994cm}}{\pgfqpoint{1.714cm}{0.968cm}}
\pgfpathcurveto{\pgfqpoint{1.74cm}{0.942cm}}{\pgfqpoint{1.775cm}{0.928cm}}{\pgfqpoint{1.811cm}{0.928cm}}
\pgfpathcurveto{\pgfqpoint{1.847cm}{0.928cm}}{\pgfqpoint{1.882cm}{0.942cm}}{\pgfqpoint{1.907cm}{0.968cm}}
\pgfpathcurveto{\pgfqpoint{1.933cm}{0.994cm}}{\pgfqpoint{1.948cm}{1.029cm}}{\pgfqpoint{1.948cm}{1.065cm}}
\pgfusepath{fill}
\begin{pgfscope}
\pgfsetdash{}{0cm}
\pgfsetlinewidth{0.818mm}
\pgfsetmiterlimit{7.0}
\pgfpathmoveto{\pgfqpoint{1.246cm}{0.315cm}}
\pgfpathlineto{\pgfqpoint{1.244cm}{1.061cm}}
\pgfusepath{stroke}
\end{pgfscope}
\pgfpathmoveto{\pgfqpoint{1.38cm}{1.065cm}}
\pgfpathcurveto{\pgfqpoint{1.38cm}{1.101cm}}{\pgfqpoint{1.366cm}{1.136cm}}{\pgfqpoint{1.34cm}{1.162cm}}
\pgfpathcurveto{\pgfqpoint{1.315cm}{1.187cm}}{\pgfqpoint{1.28cm}{1.202cm}}{\pgfqpoint{1.244cm}{1.202cm}}
\pgfpathcurveto{\pgfqpoint{1.207cm}{1.202cm}}{\pgfqpoint{1.173cm}{1.187cm}}{\pgfqpoint{1.147cm}{1.162cm}}
\pgfpathcurveto{\pgfqpoint{1.121cm}{1.136cm}}{\pgfqpoint{1.107cm}{1.101cm}}{\pgfqpoint{1.107cm}{1.065cm}}
\pgfpathcurveto{\pgfqpoint{1.107cm}{1.029cm}}{\pgfqpoint{1.121cm}{0.994cm}}{\pgfqpoint{1.147cm}{0.968cm}}
\pgfpathcurveto{\pgfqpoint{1.173cm}{0.942cm}}{\pgfqpoint{1.207cm}{0.928cm}}{\pgfqpoint{1.244cm}{0.928cm}}
\pgfpathcurveto{\pgfqpoint{1.28cm}{0.928cm}}{\pgfqpoint{1.315cm}{0.942cm}}{\pgfqpoint{1.34cm}{0.968cm}}
\pgfpathcurveto{\pgfqpoint{1.366cm}{0.994cm}}{\pgfqpoint{1.38cm}{1.029cm}}{\pgfqpoint{1.38cm}{1.065cm}}
\pgfusepath{fill}
\begin{pgfscope}
\pgfsetdash{}{0cm}
\pgfsetlinewidth{0.818mm}
\pgfsetmiterlimit{4.0}
\pgfpathmoveto{\pgfqpoint{1.383cm}{0.178cm}}
\pgfpathcurveto{\pgfqpoint{1.383cm}{0.214cm}}{\pgfqpoint{1.369cm}{0.249cm}}{\pgfqpoint{1.343cm}{0.275cm}}
\pgfpathcurveto{\pgfqpoint{1.317cm}{0.3cm}}{\pgfqpoint{1.283cm}{0.315cm}}{\pgfqpoint{1.246cm}{0.315cm}}
\pgfpathcurveto{\pgfqpoint{1.21cm}{0.315cm}}{\pgfqpoint{1.175cm}{0.3cm}}{\pgfqpoint{1.15cm}{0.275cm}}
\pgfpathcurveto{\pgfqpoint{1.124cm}{0.249cm}}{\pgfqpoint{1.11cm}{0.214cm}}{\pgfqpoint{1.11cm}{0.178cm}}
\pgfpathcurveto{\pgfqpoint{1.11cm}{0.141cm}}{\pgfqpoint{1.124cm}{0.107cm}}{\pgfqpoint{1.15cm}{0.081cm}}
\pgfpathcurveto{\pgfqpoint{1.175cm}{0.055cm}}{\pgfqpoint{1.21cm}{0.041cm}}{\pgfqpoint{1.246cm}{0.041cm}}
\pgfpathcurveto{\pgfqpoint{1.283cm}{0.041cm}}{\pgfqpoint{1.317cm}{0.055cm}}{\pgfqpoint{1.343cm}{0.081cm}}
\pgfpathcurveto{\pgfqpoint{1.369cm}{0.107cm}}{\pgfqpoint{1.383cm}{0.141cm}}{\pgfqpoint{1.383cm}{0.178cm}}
\pgfusepath{stroke}
\end{pgfscope}
\end{pgfscope}
\end{pgfscope}
\end{pgfscope}
\end{tikzpicture}}}, X^{\!\resizebox{!}{.8em}{
\begin{tikzpicture}
\pgfpathmoveto{\pgfqpoint{0cm}{-0.035cm}}
\pgfpathlineto{\pgfqpoint{1.976cm}{-0.035cm}}
\pgfpathlineto{\pgfqpoint{1.976cm}{1.94cm}}
\pgfpathlineto{\pgfqpoint{0cm}{1.94cm}}
\pgfpathclose
\pgfusepath{clip}
\begin{pgfscope}
\begin{pgfscope}
\pgfpathmoveto{\pgfqpoint{0cm}{-0.035cm}}
\pgfpathlineto{\pgfqpoint{1.976cm}{-0.035cm}}
\pgfpathlineto{\pgfqpoint{1.976cm}{1.94cm}}
\pgfpathlineto{\pgfqpoint{0cm}{1.94cm}}
\pgfpathclose
\pgfusepath{clip}
\begin{pgfscope}
\begin{pgfscope}
\pgfsetdash{}{0cm}
\pgfsetlinewidth{0.818mm}
\pgfsetroundcap
\pgfsetroundjoin
\pgfsetmiterlimit{7.0}
\definecolor{eps2pgf_color}{gray}{0}\pgfsetstrokecolor{eps2pgf_color}\pgfsetfillcolor{eps2pgf_color}
\pgfpathmoveto{\pgfqpoint{0.117cm}{1.815cm}}
\pgfpathlineto{\pgfqpoint{0.682cm}{1.065cm}}
\pgfpathlineto{\pgfqpoint{1.246cm}{1.815cm}}
\pgfusepath{stroke}
\end{pgfscope}
\definecolor{eps2pgf_color}{gray}{0}\pgfsetstrokecolor{eps2pgf_color}\pgfsetfillcolor{eps2pgf_color}
\pgfpathmoveto{\pgfqpoint{0.273cm}{1.789cm}}
\pgfpathcurveto{\pgfqpoint{0.273cm}{1.825cm}}{\pgfqpoint{0.259cm}{1.86cm}}{\pgfqpoint{0.233cm}{1.886cm}}
\pgfpathcurveto{\pgfqpoint{0.207cm}{1.912cm}}{\pgfqpoint{0.173cm}{1.926cm}}{\pgfqpoint{0.137cm}{1.926cm}}
\pgfpathcurveto{\pgfqpoint{0.1cm}{1.926cm}}{\pgfqpoint{0.066cm}{1.912cm}}{\pgfqpoint{0.04cm}{1.886cm}}
\pgfpathcurveto{\pgfqpoint{0.014cm}{1.86cm}}{\pgfqpoint{0cm}{1.825cm}}{\pgfqpoint{0cm}{1.789cm}}
\pgfpathcurveto{\pgfqpoint{0cm}{1.753cm}}{\pgfqpoint{0.014cm}{1.718cm}}{\pgfqpoint{0.04cm}{1.692cm}}
\pgfpathcurveto{\pgfqpoint{0.066cm}{1.667cm}}{\pgfqpoint{0.1cm}{1.652cm}}{\pgfqpoint{0.137cm}{1.652cm}}
\pgfpathcurveto{\pgfqpoint{0.173cm}{1.652cm}}{\pgfqpoint{0.207cm}{1.667cm}}{\pgfqpoint{0.233cm}{1.692cm}}
\pgfpathcurveto{\pgfqpoint{0.259cm}{1.718cm}}{\pgfqpoint{0.273cm}{1.753cm}}{\pgfqpoint{0.273cm}{1.789cm}}
\pgfusepath{fill}
\begin{pgfscope}
\pgfsetdash{}{0cm}
\pgfsetlinewidth{0.818mm}
\pgfsetmiterlimit{7.0}
\pgfpathmoveto{\pgfqpoint{0.682cm}{1.065cm}}
\pgfpathlineto{\pgfqpoint{0.679cm}{1.812cm}}
\pgfusepath{stroke}
\end{pgfscope}
\pgfpathmoveto{\pgfqpoint{0.815cm}{1.793cm}}
\pgfpathcurveto{\pgfqpoint{0.815cm}{1.829cm}}{\pgfqpoint{0.801cm}{1.864cm}}{\pgfqpoint{0.775cm}{1.89cm}}
\pgfpathcurveto{\pgfqpoint{0.75cm}{1.915cm}}{\pgfqpoint{0.715cm}{1.93cm}}{\pgfqpoint{0.679cm}{1.93cm}}
\pgfpathcurveto{\pgfqpoint{0.643cm}{1.93cm}}{\pgfqpoint{0.608cm}{1.915cm}}{\pgfqpoint{0.582cm}{1.89cm}}
\pgfpathcurveto{\pgfqpoint{0.557cm}{1.864cm}}{\pgfqpoint{0.542cm}{1.829cm}}{\pgfqpoint{0.542cm}{1.793cm}}
\pgfpathcurveto{\pgfqpoint{0.542cm}{1.756cm}}{\pgfqpoint{0.557cm}{1.722cm}}{\pgfqpoint{0.582cm}{1.696cm}}
\pgfpathcurveto{\pgfqpoint{0.608cm}{1.67cm}}{\pgfqpoint{0.643cm}{1.656cm}}{\pgfqpoint{0.679cm}{1.656cm}}
\pgfpathcurveto{\pgfqpoint{0.715cm}{1.656cm}}{\pgfqpoint{0.75cm}{1.67cm}}{\pgfqpoint{0.775cm}{1.696cm}}
\pgfpathcurveto{\pgfqpoint{0.801cm}{1.722cm}}{\pgfqpoint{0.815cm}{1.756cm}}{\pgfqpoint{0.815cm}{1.793cm}}
\pgfusepath{fill}
\pgfpathmoveto{\pgfqpoint{1.345cm}{1.765cm}}
\pgfpathcurveto{\pgfqpoint{1.345cm}{1.801cm}}{\pgfqpoint{1.331cm}{1.836cm}}{\pgfqpoint{1.305cm}{1.862cm}}
\pgfpathcurveto{\pgfqpoint{1.28cm}{1.887cm}}{\pgfqpoint{1.245cm}{1.902cm}}{\pgfqpoint{1.209cm}{1.902cm}}
\pgfpathcurveto{\pgfqpoint{1.172cm}{1.902cm}}{\pgfqpoint{1.138cm}{1.887cm}}{\pgfqpoint{1.112cm}{1.862cm}}
\pgfpathcurveto{\pgfqpoint{1.087cm}{1.836cm}}{\pgfqpoint{1.072cm}{1.801cm}}{\pgfqpoint{1.072cm}{1.765cm}}
\pgfpathcurveto{\pgfqpoint{1.072cm}{1.728cm}}{\pgfqpoint{1.087cm}{1.694cm}}{\pgfqpoint{1.112cm}{1.668cm}}
\pgfpathcurveto{\pgfqpoint{1.138cm}{1.642cm}}{\pgfqpoint{1.172cm}{1.628cm}}{\pgfqpoint{1.209cm}{1.628cm}}
\pgfpathcurveto{\pgfqpoint{1.245cm}{1.628cm}}{\pgfqpoint{1.28cm}{1.642cm}}{\pgfqpoint{1.305cm}{1.668cm}}
\pgfpathcurveto{\pgfqpoint{1.331cm}{1.694cm}}{\pgfqpoint{1.345cm}{1.728cm}}{\pgfqpoint{1.345cm}{1.765cm}}
\pgfusepath{fill}
\begin{pgfscope}
\pgfsetdash{}{0cm}
\pgfsetlinewidth{0.818mm}
\pgfsetroundcap
\pgfsetroundjoin
\pgfsetmiterlimit{7.0}
\pgfpathmoveto{\pgfqpoint{0.682cm}{1.065cm}}
\pgfpathlineto{\pgfqpoint{1.246cm}{0.315cm}}
\pgfpathlineto{\pgfqpoint{1.811cm}{1.065cm}}
\pgfusepath{stroke}
\end{pgfscope}
\pgfpathmoveto{\pgfqpoint{1.948cm}{1.065cm}}
\pgfpathcurveto{\pgfqpoint{1.948cm}{1.101cm}}{\pgfqpoint{1.933cm}{1.136cm}}{\pgfqpoint{1.907cm}{1.162cm}}
\pgfpathcurveto{\pgfqpoint{1.882cm}{1.187cm}}{\pgfqpoint{1.847cm}{1.202cm}}{\pgfqpoint{1.811cm}{1.202cm}}
\pgfpathcurveto{\pgfqpoint{1.775cm}{1.202cm}}{\pgfqpoint{1.74cm}{1.187cm}}{\pgfqpoint{1.714cm}{1.162cm}}
\pgfpathcurveto{\pgfqpoint{1.689cm}{1.136cm}}{\pgfqpoint{1.674cm}{1.101cm}}{\pgfqpoint{1.674cm}{1.065cm}}
\pgfpathcurveto{\pgfqpoint{1.674cm}{1.029cm}}{\pgfqpoint{1.689cm}{0.994cm}}{\pgfqpoint{1.714cm}{0.968cm}}
\pgfpathcurveto{\pgfqpoint{1.74cm}{0.942cm}}{\pgfqpoint{1.775cm}{0.928cm}}{\pgfqpoint{1.811cm}{0.928cm}}
\pgfpathcurveto{\pgfqpoint{1.847cm}{0.928cm}}{\pgfqpoint{1.882cm}{0.942cm}}{\pgfqpoint{1.907cm}{0.968cm}}
\pgfpathcurveto{\pgfqpoint{1.933cm}{0.994cm}}{\pgfqpoint{1.948cm}{1.029cm}}{\pgfqpoint{1.948cm}{1.065cm}}
\pgfusepath{fill}
\begin{pgfscope}
\pgfsetdash{}{0cm}
\pgfsetlinewidth{0.818mm}
\pgfsetmiterlimit{7.0}
\pgfpathmoveto{\pgfqpoint{1.246cm}{0.315cm}}
\pgfpathlineto{\pgfqpoint{1.244cm}{1.061cm}}
\pgfusepath{stroke}
\end{pgfscope}
\pgfpathmoveto{\pgfqpoint{1.38cm}{1.065cm}}
\pgfpathcurveto{\pgfqpoint{1.38cm}{1.101cm}}{\pgfqpoint{1.366cm}{1.136cm}}{\pgfqpoint{1.34cm}{1.162cm}}
\pgfpathcurveto{\pgfqpoint{1.315cm}{1.187cm}}{\pgfqpoint{1.28cm}{1.202cm}}{\pgfqpoint{1.244cm}{1.202cm}}
\pgfpathcurveto{\pgfqpoint{1.207cm}{1.202cm}}{\pgfqpoint{1.173cm}{1.187cm}}{\pgfqpoint{1.147cm}{1.162cm}}
\pgfpathcurveto{\pgfqpoint{1.121cm}{1.136cm}}{\pgfqpoint{1.107cm}{1.101cm}}{\pgfqpoint{1.107cm}{1.065cm}}
\pgfpathcurveto{\pgfqpoint{1.107cm}{1.029cm}}{\pgfqpoint{1.121cm}{0.994cm}}{\pgfqpoint{1.147cm}{0.968cm}}
\pgfpathcurveto{\pgfqpoint{1.173cm}{0.942cm}}{\pgfqpoint{1.207cm}{0.928cm}}{\pgfqpoint{1.244cm}{0.928cm}}
\pgfpathcurveto{\pgfqpoint{1.28cm}{0.928cm}}{\pgfqpoint{1.315cm}{0.942cm}}{\pgfqpoint{1.34cm}{0.968cm}}
\pgfpathcurveto{\pgfqpoint{1.366cm}{0.994cm}}{\pgfqpoint{1.38cm}{1.029cm}}{\pgfqpoint{1.38cm}{1.065cm}}
\pgfusepath{fill}
\begin{pgfscope}
\pgfsetdash{}{0cm}
\pgfsetlinewidth{0.818mm}
\pgfsetmiterlimit{4.0}
\pgfpathmoveto{\pgfqpoint{1.383cm}{0.178cm}}
\pgfpathcurveto{\pgfqpoint{1.383cm}{0.214cm}}{\pgfqpoint{1.369cm}{0.249cm}}{\pgfqpoint{1.343cm}{0.275cm}}
\pgfpathcurveto{\pgfqpoint{1.317cm}{0.3cm}}{\pgfqpoint{1.283cm}{0.315cm}}{\pgfqpoint{1.246cm}{0.315cm}}
\pgfpathcurveto{\pgfqpoint{1.21cm}{0.315cm}}{\pgfqpoint{1.175cm}{0.3cm}}{\pgfqpoint{1.15cm}{0.275cm}}
\pgfpathcurveto{\pgfqpoint{1.124cm}{0.249cm}}{\pgfqpoint{1.11cm}{0.214cm}}{\pgfqpoint{1.11cm}{0.178cm}}
\pgfpathcurveto{\pgfqpoint{1.11cm}{0.141cm}}{\pgfqpoint{1.124cm}{0.107cm}}{\pgfqpoint{1.15cm}{0.081cm}}
\pgfpathcurveto{\pgfqpoint{1.175cm}{0.055cm}}{\pgfqpoint{1.21cm}{0.041cm}}{\pgfqpoint{1.246cm}{0.041cm}}
\pgfpathcurveto{\pgfqpoint{1.283cm}{0.041cm}}{\pgfqpoint{1.317cm}{0.055cm}}{\pgfqpoint{1.343cm}{0.081cm}}
\pgfpathcurveto{\pgfqpoint{1.369cm}{0.107cm}}{\pgfqpoint{1.383cm}{0.141cm}}{\pgfqpoint{1.383cm}{0.178cm}}
\pgfusepath{stroke}
\end{pgfscope}
\end{pgfscope}
\end{pgfscope}
\end{pgfscope}
\end{tikzpicture}}}
\end{equation}
and their periodic versions
\begin{equation}\label{eq:r45M}
X_{M},\llbracket X_{M}^{2}\rrbracket,\llbracket X_{M}^{3}\rrbracket, X^{\!\resizebox{0.6em}{!}{
\begin{tikzpicture}
\pgfpathmoveto{\pgfqpoint{0cm}{-0.035cm}}
\pgfpathlineto{\pgfqpoint{1.376cm}{-0.035cm}}
\pgfpathlineto{\pgfqpoint{1.376cm}{1.552cm}}
\pgfpathlineto{\pgfqpoint{0cm}{1.552cm}}
\pgfpathclose
\pgfusepath{clip}
\begin{pgfscope}
\begin{pgfscope}
\pgfpathmoveto{\pgfqpoint{0cm}{-0.035cm}}
\pgfpathlineto{\pgfqpoint{1.376cm}{-0.035cm}}
\pgfpathlineto{\pgfqpoint{1.376cm}{1.552cm}}
\pgfpathlineto{\pgfqpoint{0cm}{1.552cm}}
\pgfpathclose
\pgfusepath{clip}
\begin{pgfscope}
\begin{pgfscope}
\pgfsetdash{}{0cm}
\pgfsetlinewidth{0.818mm}
\pgfsetroundcap
\pgfsetroundjoin
\pgfsetmiterlimit{7.0}
\definecolor{eps2pgf_color}{gray}{0}\pgfsetstrokecolor{eps2pgf_color}\pgfsetfillcolor{eps2pgf_color}
\pgfpathmoveto{\pgfqpoint{0.117cm}{1.421cm}}
\pgfpathlineto{\pgfqpoint{0.682cm}{0.671cm}}
\pgfpathlineto{\pgfqpoint{1.246cm}{1.421cm}}
\pgfusepath{stroke}
\end{pgfscope}
\definecolor{eps2pgf_color}{gray}{0}\pgfsetstrokecolor{eps2pgf_color}\pgfsetfillcolor{eps2pgf_color}
\pgfpathmoveto{\pgfqpoint{0.273cm}{1.395cm}}
\pgfpathcurveto{\pgfqpoint{0.273cm}{1.432cm}}{\pgfqpoint{0.259cm}{1.467cm}}{\pgfqpoint{0.233cm}{1.492cm}}
\pgfpathcurveto{\pgfqpoint{0.207cm}{1.518cm}}{\pgfqpoint{0.173cm}{1.532cm}}{\pgfqpoint{0.137cm}{1.532cm}}
\pgfpathcurveto{\pgfqpoint{0.1cm}{1.532cm}}{\pgfqpoint{0.066cm}{1.518cm}}{\pgfqpoint{0.04cm}{1.492cm}}
\pgfpathcurveto{\pgfqpoint{0.014cm}{1.467cm}}{\pgfqpoint{0cm}{1.432cm}}{\pgfqpoint{0cm}{1.395cm}}
\pgfpathcurveto{\pgfqpoint{0cm}{1.359cm}}{\pgfqpoint{0.014cm}{1.324cm}}{\pgfqpoint{0.04cm}{1.299cm}}
\pgfpathcurveto{\pgfqpoint{0.066cm}{1.273cm}}{\pgfqpoint{0.1cm}{1.258cm}}{\pgfqpoint{0.137cm}{1.258cm}}
\pgfpathcurveto{\pgfqpoint{0.173cm}{1.258cm}}{\pgfqpoint{0.207cm}{1.273cm}}{\pgfqpoint{0.233cm}{1.299cm}}
\pgfpathcurveto{\pgfqpoint{0.259cm}{1.324cm}}{\pgfqpoint{0.273cm}{1.359cm}}{\pgfqpoint{0.273cm}{1.395cm}}
\pgfusepath{fill}
\begin{pgfscope}
\pgfsetdash{}{0cm}
\pgfsetlinewidth{0.818mm}
\pgfsetmiterlimit{7.0}
\pgfpathmoveto{\pgfqpoint{0.682cm}{0.671cm}}
\pgfpathlineto{\pgfqpoint{0.679cm}{1.418cm}}
\pgfusepath{stroke}
\end{pgfscope}
\pgfpathmoveto{\pgfqpoint{0.815cm}{1.399cm}}
\pgfpathcurveto{\pgfqpoint{0.815cm}{1.435cm}}{\pgfqpoint{0.801cm}{1.47cm}}{\pgfqpoint{0.775cm}{1.496cm}}
\pgfpathcurveto{\pgfqpoint{0.75cm}{1.521cm}}{\pgfqpoint{0.715cm}{1.536cm}}{\pgfqpoint{0.679cm}{1.536cm}}
\pgfpathcurveto{\pgfqpoint{0.643cm}{1.536cm}}{\pgfqpoint{0.608cm}{1.521cm}}{\pgfqpoint{0.582cm}{1.496cm}}
\pgfpathcurveto{\pgfqpoint{0.557cm}{1.47cm}}{\pgfqpoint{0.542cm}{1.435cm}}{\pgfqpoint{0.542cm}{1.399cm}}
\pgfpathcurveto{\pgfqpoint{0.542cm}{1.363cm}}{\pgfqpoint{0.557cm}{1.328cm}}{\pgfqpoint{0.582cm}{1.302cm}}
\pgfpathcurveto{\pgfqpoint{0.608cm}{1.276cm}}{\pgfqpoint{0.643cm}{1.262cm}}{\pgfqpoint{0.679cm}{1.262cm}}
\pgfpathcurveto{\pgfqpoint{0.715cm}{1.262cm}}{\pgfqpoint{0.75cm}{1.276cm}}{\pgfqpoint{0.775cm}{1.302cm}}
\pgfpathcurveto{\pgfqpoint{0.801cm}{1.328cm}}{\pgfqpoint{0.815cm}{1.363cm}}{\pgfqpoint{0.815cm}{1.399cm}}
\pgfusepath{fill}
\pgfpathmoveto{\pgfqpoint{1.345cm}{1.371cm}}
\pgfpathcurveto{\pgfqpoint{1.345cm}{1.408cm}}{\pgfqpoint{1.331cm}{1.442cm}}{\pgfqpoint{1.305cm}{1.468cm}}
\pgfpathcurveto{\pgfqpoint{1.28cm}{1.494cm}}{\pgfqpoint{1.245cm}{1.508cm}}{\pgfqpoint{1.209cm}{1.508cm}}
\pgfpathcurveto{\pgfqpoint{1.172cm}{1.508cm}}{\pgfqpoint{1.138cm}{1.494cm}}{\pgfqpoint{1.112cm}{1.468cm}}
\pgfpathcurveto{\pgfqpoint{1.087cm}{1.442cm}}{\pgfqpoint{1.072cm}{1.408cm}}{\pgfqpoint{1.072cm}{1.371cm}}
\pgfpathcurveto{\pgfqpoint{1.072cm}{1.335cm}}{\pgfqpoint{1.087cm}{1.3cm}}{\pgfqpoint{1.112cm}{1.274cm}}
\pgfpathcurveto{\pgfqpoint{1.138cm}{1.249cm}}{\pgfqpoint{1.172cm}{1.234cm}}{\pgfqpoint{1.209cm}{1.234cm}}
\pgfpathcurveto{\pgfqpoint{1.245cm}{1.234cm}}{\pgfqpoint{1.28cm}{1.249cm}}{\pgfqpoint{1.305cm}{1.274cm}}
\pgfpathcurveto{\pgfqpoint{1.331cm}{1.3cm}}{\pgfqpoint{1.345cm}{1.335cm}}{\pgfqpoint{1.345cm}{1.371cm}}
\pgfusepath{fill}
\begin{pgfscope}
\pgfsetdash{}{0cm}
\pgfsetlinewidth{0.818mm}
\pgfsetroundcap
\pgfsetmiterlimit{4.0}
\pgfpathmoveto{\pgfqpoint{0.682cm}{0.671cm}}
\pgfpathlineto{\pgfqpoint{0.682cm}{0.042cm}}
\pgfusepath{stroke}
\end{pgfscope}
\end{pgfscope}
\end{pgfscope}
\end{pgfscope}
\end{tikzpicture}}}_{M}, X^{\!\resizebox{0.6em}{!}{
\begin{tikzpicture}
\pgfpathmoveto{\pgfqpoint{0cm}{0cm}}
\pgfpathlineto{\pgfqpoint{1.376cm}{0cm}}
\pgfpathlineto{\pgfqpoint{1.376cm}{1.588cm}}
\pgfpathlineto{\pgfqpoint{0cm}{1.588cm}}
\pgfpathclose
\pgfusepath{clip}
\begin{pgfscope}
\begin{pgfscope}
\pgfpathmoveto{\pgfqpoint{0cm}{0cm}}
\pgfpathlineto{\pgfqpoint{1.376cm}{0cm}}
\pgfpathlineto{\pgfqpoint{1.376cm}{1.588cm}}
\pgfpathlineto{\pgfqpoint{0cm}{1.588cm}}
\pgfpathclose
\pgfusepath{clip}
\begin{pgfscope}
\begin{pgfscope}
\definecolor{eps2pgf_color}{gray}{0.976471}\pgfsetstrokecolor{eps2pgf_color}\pgfsetfillcolor{eps2pgf_color}
\pgfpathmoveto{\pgfqpoint{0cm}{0cm}}
\pgfpathlineto{\pgfqpoint{1.376cm}{0cm}}
\pgfpathlineto{\pgfqpoint{1.376cm}{1.588cm}}
\pgfpathlineto{\pgfqpoint{0cm}{1.588cm}}
\pgfpathclose
\pgfusepath{fill}
\end{pgfscope}
\begin{pgfscope}
\pgfsetdash{}{0cm}
\pgfsetlinewidth{0.818mm}
\pgfsetroundcap
\pgfsetroundjoin
\pgfsetmiterlimit{7.0}
\definecolor{eps2pgf_color}{gray}{0}\pgfsetstrokecolor{eps2pgf_color}\pgfsetfillcolor{eps2pgf_color}
\pgfpathmoveto{\pgfqpoint{0.117cm}{1.476cm}}
\pgfpathlineto{\pgfqpoint{0.682cm}{0.726cm}}
\pgfpathlineto{\pgfqpoint{1.246cm}{1.476cm}}
\pgfusepath{stroke}
\end{pgfscope}
\definecolor{eps2pgf_color}{gray}{0}\pgfsetstrokecolor{eps2pgf_color}\pgfsetfillcolor{eps2pgf_color}
\pgfpathmoveto{\pgfqpoint{0.273cm}{1.451cm}}
\pgfpathcurveto{\pgfqpoint{0.273cm}{1.487cm}}{\pgfqpoint{0.259cm}{1.522cm}}{\pgfqpoint{0.233cm}{1.547cm}}
\pgfpathcurveto{\pgfqpoint{0.207cm}{1.573cm}}{\pgfqpoint{0.173cm}{1.588cm}}{\pgfqpoint{0.137cm}{1.588cm}}
\pgfpathcurveto{\pgfqpoint{0.1cm}{1.588cm}}{\pgfqpoint{0.066cm}{1.573cm}}{\pgfqpoint{0.04cm}{1.547cm}}
\pgfpathcurveto{\pgfqpoint{0.014cm}{1.522cm}}{\pgfqpoint{0cm}{1.487cm}}{\pgfqpoint{0cm}{1.451cm}}
\pgfpathcurveto{\pgfqpoint{0cm}{1.414cm}}{\pgfqpoint{0.014cm}{1.379cm}}{\pgfqpoint{0.04cm}{1.354cm}}
\pgfpathcurveto{\pgfqpoint{0.066cm}{1.328cm}}{\pgfqpoint{0.1cm}{1.314cm}}{\pgfqpoint{0.137cm}{1.314cm}}
\pgfpathcurveto{\pgfqpoint{0.173cm}{1.314cm}}{\pgfqpoint{0.207cm}{1.328cm}}{\pgfqpoint{0.233cm}{1.354cm}}
\pgfpathcurveto{\pgfqpoint{0.259cm}{1.379cm}}{\pgfqpoint{0.273cm}{1.414cm}}{\pgfqpoint{0.273cm}{1.451cm}}
\pgfusepath{fill}
\pgfpathmoveto{\pgfqpoint{1.345cm}{1.426cm}}
\pgfpathcurveto{\pgfqpoint{1.345cm}{1.463cm}}{\pgfqpoint{1.331cm}{1.497cm}}{\pgfqpoint{1.305cm}{1.523cm}}
\pgfpathcurveto{\pgfqpoint{1.28cm}{1.549cm}}{\pgfqpoint{1.245cm}{1.563cm}}{\pgfqpoint{1.209cm}{1.563cm}}
\pgfpathcurveto{\pgfqpoint{1.172cm}{1.563cm}}{\pgfqpoint{1.138cm}{1.549cm}}{\pgfqpoint{1.112cm}{1.523cm}}
\pgfpathcurveto{\pgfqpoint{1.087cm}{1.497cm}}{\pgfqpoint{1.072cm}{1.463cm}}{\pgfqpoint{1.072cm}{1.426cm}}
\pgfpathcurveto{\pgfqpoint{1.072cm}{1.39cm}}{\pgfqpoint{1.087cm}{1.355cm}}{\pgfqpoint{1.112cm}{1.329cm}}
\pgfpathcurveto{\pgfqpoint{1.138cm}{1.304cm}}{\pgfqpoint{1.172cm}{1.289cm}}{\pgfqpoint{1.209cm}{1.289cm}}
\pgfpathcurveto{\pgfqpoint{1.245cm}{1.289cm}}{\pgfqpoint{1.28cm}{1.304cm}}{\pgfqpoint{1.305cm}{1.329cm}}
\pgfpathcurveto{\pgfqpoint{1.331cm}{1.355cm}}{\pgfqpoint{1.345cm}{1.39cm}}{\pgfqpoint{1.345cm}{1.426cm}}
\pgfusepath{fill}
\begin{pgfscope}
\pgfsetdash{}{0cm}
\pgfsetlinewidth{0.818mm}
\pgfsetroundcap
\pgfsetmiterlimit{4.0}
\pgfpathmoveto{\pgfqpoint{0.682cm}{0.726cm}}
\pgfpathlineto{\pgfqpoint{0.682cm}{0.097cm}}
\pgfusepath{stroke}
\end{pgfscope}
\end{pgfscope}
\end{pgfscope}
\end{pgfscope}
\end{tikzpicture}}}_{M}, X^{\!\resizebox{!}{.8em}{
\begin{tikzpicture}
\pgfpathmoveto{\pgfqpoint{0cm}{-0.035cm}}
\pgfpathlineto{\pgfqpoint{1.976cm}{-0.035cm}}
\pgfpathlineto{\pgfqpoint{1.976cm}{1.94cm}}
\pgfpathlineto{\pgfqpoint{0cm}{1.94cm}}
\pgfpathclose
\pgfusepath{clip}
\begin{pgfscope}
\begin{pgfscope}
\pgfpathmoveto{\pgfqpoint{0cm}{-0.035cm}}
\pgfpathlineto{\pgfqpoint{1.976cm}{-0.035cm}}
\pgfpathlineto{\pgfqpoint{1.976cm}{1.94cm}}
\pgfpathlineto{\pgfqpoint{0cm}{1.94cm}}
\pgfpathclose
\pgfusepath{clip}
\begin{pgfscope}
\begin{pgfscope}
\pgfsetdash{}{0cm}
\pgfsetlinewidth{0.818mm}
\pgfsetroundcap
\pgfsetroundjoin
\pgfsetmiterlimit{7.0}
\definecolor{eps2pgf_color}{gray}{0}\pgfsetstrokecolor{eps2pgf_color}\pgfsetfillcolor{eps2pgf_color}
\pgfpathmoveto{\pgfqpoint{0.117cm}{1.815cm}}
\pgfpathlineto{\pgfqpoint{0.682cm}{1.065cm}}
\pgfpathlineto{\pgfqpoint{1.246cm}{1.815cm}}
\pgfusepath{stroke}
\end{pgfscope}
\definecolor{eps2pgf_color}{gray}{0}\pgfsetstrokecolor{eps2pgf_color}\pgfsetfillcolor{eps2pgf_color}
\pgfpathmoveto{\pgfqpoint{0.273cm}{1.789cm}}
\pgfpathcurveto{\pgfqpoint{0.273cm}{1.825cm}}{\pgfqpoint{0.259cm}{1.86cm}}{\pgfqpoint{0.233cm}{1.886cm}}
\pgfpathcurveto{\pgfqpoint{0.207cm}{1.912cm}}{\pgfqpoint{0.173cm}{1.926cm}}{\pgfqpoint{0.137cm}{1.926cm}}
\pgfpathcurveto{\pgfqpoint{0.1cm}{1.926cm}}{\pgfqpoint{0.066cm}{1.912cm}}{\pgfqpoint{0.04cm}{1.886cm}}
\pgfpathcurveto{\pgfqpoint{0.014cm}{1.86cm}}{\pgfqpoint{0cm}{1.825cm}}{\pgfqpoint{0cm}{1.789cm}}
\pgfpathcurveto{\pgfqpoint{0cm}{1.753cm}}{\pgfqpoint{0.014cm}{1.718cm}}{\pgfqpoint{0.04cm}{1.692cm}}
\pgfpathcurveto{\pgfqpoint{0.066cm}{1.667cm}}{\pgfqpoint{0.1cm}{1.652cm}}{\pgfqpoint{0.137cm}{1.652cm}}
\pgfpathcurveto{\pgfqpoint{0.173cm}{1.652cm}}{\pgfqpoint{0.207cm}{1.667cm}}{\pgfqpoint{0.233cm}{1.692cm}}
\pgfpathcurveto{\pgfqpoint{0.259cm}{1.718cm}}{\pgfqpoint{0.273cm}{1.753cm}}{\pgfqpoint{0.273cm}{1.789cm}}
\pgfusepath{fill}
\begin{pgfscope}
\pgfsetdash{}{0cm}
\pgfsetlinewidth{0.818mm}
\pgfsetmiterlimit{7.0}
\pgfpathmoveto{\pgfqpoint{0.682cm}{1.065cm}}
\pgfpathlineto{\pgfqpoint{0.679cm}{1.812cm}}
\pgfusepath{stroke}
\end{pgfscope}
\pgfpathmoveto{\pgfqpoint{0.815cm}{1.793cm}}
\pgfpathcurveto{\pgfqpoint{0.815cm}{1.829cm}}{\pgfqpoint{0.801cm}{1.864cm}}{\pgfqpoint{0.775cm}{1.89cm}}
\pgfpathcurveto{\pgfqpoint{0.75cm}{1.915cm}}{\pgfqpoint{0.715cm}{1.93cm}}{\pgfqpoint{0.679cm}{1.93cm}}
\pgfpathcurveto{\pgfqpoint{0.643cm}{1.93cm}}{\pgfqpoint{0.608cm}{1.915cm}}{\pgfqpoint{0.582cm}{1.89cm}}
\pgfpathcurveto{\pgfqpoint{0.557cm}{1.864cm}}{\pgfqpoint{0.542cm}{1.829cm}}{\pgfqpoint{0.542cm}{1.793cm}}
\pgfpathcurveto{\pgfqpoint{0.542cm}{1.756cm}}{\pgfqpoint{0.557cm}{1.722cm}}{\pgfqpoint{0.582cm}{1.696cm}}
\pgfpathcurveto{\pgfqpoint{0.608cm}{1.67cm}}{\pgfqpoint{0.643cm}{1.656cm}}{\pgfqpoint{0.679cm}{1.656cm}}
\pgfpathcurveto{\pgfqpoint{0.715cm}{1.656cm}}{\pgfqpoint{0.75cm}{1.67cm}}{\pgfqpoint{0.775cm}{1.696cm}}
\pgfpathcurveto{\pgfqpoint{0.801cm}{1.722cm}}{\pgfqpoint{0.815cm}{1.756cm}}{\pgfqpoint{0.815cm}{1.793cm}}
\pgfusepath{fill}
\pgfpathmoveto{\pgfqpoint{1.345cm}{1.765cm}}
\pgfpathcurveto{\pgfqpoint{1.345cm}{1.801cm}}{\pgfqpoint{1.331cm}{1.836cm}}{\pgfqpoint{1.305cm}{1.862cm}}
\pgfpathcurveto{\pgfqpoint{1.28cm}{1.887cm}}{\pgfqpoint{1.245cm}{1.902cm}}{\pgfqpoint{1.209cm}{1.902cm}}
\pgfpathcurveto{\pgfqpoint{1.172cm}{1.902cm}}{\pgfqpoint{1.138cm}{1.887cm}}{\pgfqpoint{1.112cm}{1.862cm}}
\pgfpathcurveto{\pgfqpoint{1.087cm}{1.836cm}}{\pgfqpoint{1.072cm}{1.801cm}}{\pgfqpoint{1.072cm}{1.765cm}}
\pgfpathcurveto{\pgfqpoint{1.072cm}{1.728cm}}{\pgfqpoint{1.087cm}{1.694cm}}{\pgfqpoint{1.112cm}{1.668cm}}
\pgfpathcurveto{\pgfqpoint{1.138cm}{1.642cm}}{\pgfqpoint{1.172cm}{1.628cm}}{\pgfqpoint{1.209cm}{1.628cm}}
\pgfpathcurveto{\pgfqpoint{1.245cm}{1.628cm}}{\pgfqpoint{1.28cm}{1.642cm}}{\pgfqpoint{1.305cm}{1.668cm}}
\pgfpathcurveto{\pgfqpoint{1.331cm}{1.694cm}}{\pgfqpoint{1.345cm}{1.728cm}}{\pgfqpoint{1.345cm}{1.765cm}}
\pgfusepath{fill}
\begin{pgfscope}
\pgfsetdash{}{0cm}
\pgfsetlinewidth{0.818mm}
\pgfsetroundcap
\pgfsetroundjoin
\pgfsetmiterlimit{7.0}
\pgfpathmoveto{\pgfqpoint{0.682cm}{1.065cm}}
\pgfpathlineto{\pgfqpoint{1.246cm}{0.315cm}}
\pgfpathlineto{\pgfqpoint{1.811cm}{1.065cm}}
\pgfusepath{stroke}
\end{pgfscope}
\pgfpathmoveto{\pgfqpoint{1.948cm}{1.065cm}}
\pgfpathcurveto{\pgfqpoint{1.948cm}{1.101cm}}{\pgfqpoint{1.933cm}{1.136cm}}{\pgfqpoint{1.907cm}{1.162cm}}
\pgfpathcurveto{\pgfqpoint{1.882cm}{1.187cm}}{\pgfqpoint{1.847cm}{1.202cm}}{\pgfqpoint{1.811cm}{1.202cm}}
\pgfpathcurveto{\pgfqpoint{1.775cm}{1.202cm}}{\pgfqpoint{1.74cm}{1.187cm}}{\pgfqpoint{1.714cm}{1.162cm}}
\pgfpathcurveto{\pgfqpoint{1.689cm}{1.136cm}}{\pgfqpoint{1.674cm}{1.101cm}}{\pgfqpoint{1.674cm}{1.065cm}}
\pgfpathcurveto{\pgfqpoint{1.674cm}{1.029cm}}{\pgfqpoint{1.689cm}{0.994cm}}{\pgfqpoint{1.714cm}{0.968cm}}
\pgfpathcurveto{\pgfqpoint{1.74cm}{0.942cm}}{\pgfqpoint{1.775cm}{0.928cm}}{\pgfqpoint{1.811cm}{0.928cm}}
\pgfpathcurveto{\pgfqpoint{1.847cm}{0.928cm}}{\pgfqpoint{1.882cm}{0.942cm}}{\pgfqpoint{1.907cm}{0.968cm}}
\pgfpathcurveto{\pgfqpoint{1.933cm}{0.994cm}}{\pgfqpoint{1.948cm}{1.029cm}}{\pgfqpoint{1.948cm}{1.065cm}}
\pgfusepath{fill}
\begin{pgfscope}
\pgfsetdash{}{0cm}
\pgfsetlinewidth{0.818mm}
\pgfsetmiterlimit{4.0}
\pgfpathmoveto{\pgfqpoint{1.383cm}{0.178cm}}
\pgfpathcurveto{\pgfqpoint{1.383cm}{0.214cm}}{\pgfqpoint{1.369cm}{0.249cm}}{\pgfqpoint{1.343cm}{0.275cm}}
\pgfpathcurveto{\pgfqpoint{1.317cm}{0.3cm}}{\pgfqpoint{1.283cm}{0.315cm}}{\pgfqpoint{1.246cm}{0.315cm}}
\pgfpathcurveto{\pgfqpoint{1.21cm}{0.315cm}}{\pgfqpoint{1.175cm}{0.3cm}}{\pgfqpoint{1.15cm}{0.275cm}}
\pgfpathcurveto{\pgfqpoint{1.124cm}{0.249cm}}{\pgfqpoint{1.11cm}{0.214cm}}{\pgfqpoint{1.11cm}{0.178cm}}
\pgfpathcurveto{\pgfqpoint{1.11cm}{0.141cm}}{\pgfqpoint{1.124cm}{0.107cm}}{\pgfqpoint{1.15cm}{0.081cm}}
\pgfpathcurveto{\pgfqpoint{1.175cm}{0.055cm}}{\pgfqpoint{1.21cm}{0.041cm}}{\pgfqpoint{1.246cm}{0.041cm}}
\pgfpathcurveto{\pgfqpoint{1.283cm}{0.041cm}}{\pgfqpoint{1.317cm}{0.055cm}}{\pgfqpoint{1.343cm}{0.081cm}}
\pgfpathcurveto{\pgfqpoint{1.369cm}{0.107cm}}{\pgfqpoint{1.383cm}{0.141cm}}{\pgfqpoint{1.383cm}{0.178cm}}
\pgfusepath{stroke}
\end{pgfscope}
\end{pgfscope}
\end{pgfscope}
\end{pgfscope}
\end{tikzpicture}}}_{M}, X^{\!\resizebox{!}{.8em}{
\begin{tikzpicture}
\pgfpathmoveto{\pgfqpoint{0cm}{-0.035cm}}
\pgfpathlineto{\pgfqpoint{1.976cm}{-0.035cm}}
\pgfpathlineto{\pgfqpoint{1.976cm}{1.94cm}}
\pgfpathlineto{\pgfqpoint{0cm}{1.94cm}}
\pgfpathclose
\pgfusepath{clip}
\begin{pgfscope}
\begin{pgfscope}
\pgfpathmoveto{\pgfqpoint{0cm}{-0.035cm}}
\pgfpathlineto{\pgfqpoint{1.976cm}{-0.035cm}}
\pgfpathlineto{\pgfqpoint{1.976cm}{1.94cm}}
\pgfpathlineto{\pgfqpoint{0cm}{1.94cm}}
\pgfpathclose
\pgfusepath{clip}
\begin{pgfscope}
\begin{pgfscope}
\pgfsetdash{}{0cm}
\pgfsetlinewidth{0.818mm}
\pgfsetroundcap
\pgfsetroundjoin
\pgfsetmiterlimit{7.0}
\definecolor{eps2pgf_color}{gray}{0}\pgfsetstrokecolor{eps2pgf_color}\pgfsetfillcolor{eps2pgf_color}
\pgfpathmoveto{\pgfqpoint{0.117cm}{1.815cm}}
\pgfpathlineto{\pgfqpoint{0.682cm}{1.065cm}}
\pgfpathlineto{\pgfqpoint{1.246cm}{1.815cm}}
\pgfusepath{stroke}
\end{pgfscope}
\definecolor{eps2pgf_color}{gray}{0}\pgfsetstrokecolor{eps2pgf_color}\pgfsetfillcolor{eps2pgf_color}
\pgfpathmoveto{\pgfqpoint{0.273cm}{1.789cm}}
\pgfpathcurveto{\pgfqpoint{0.273cm}{1.825cm}}{\pgfqpoint{0.259cm}{1.86cm}}{\pgfqpoint{0.233cm}{1.886cm}}
\pgfpathcurveto{\pgfqpoint{0.207cm}{1.912cm}}{\pgfqpoint{0.173cm}{1.926cm}}{\pgfqpoint{0.137cm}{1.926cm}}
\pgfpathcurveto{\pgfqpoint{0.1cm}{1.926cm}}{\pgfqpoint{0.066cm}{1.912cm}}{\pgfqpoint{0.04cm}{1.886cm}}
\pgfpathcurveto{\pgfqpoint{0.014cm}{1.86cm}}{\pgfqpoint{0cm}{1.825cm}}{\pgfqpoint{0cm}{1.789cm}}
\pgfpathcurveto{\pgfqpoint{0cm}{1.753cm}}{\pgfqpoint{0.014cm}{1.718cm}}{\pgfqpoint{0.04cm}{1.692cm}}
\pgfpathcurveto{\pgfqpoint{0.066cm}{1.667cm}}{\pgfqpoint{0.1cm}{1.652cm}}{\pgfqpoint{0.137cm}{1.652cm}}
\pgfpathcurveto{\pgfqpoint{0.173cm}{1.652cm}}{\pgfqpoint{0.207cm}{1.667cm}}{\pgfqpoint{0.233cm}{1.692cm}}
\pgfpathcurveto{\pgfqpoint{0.259cm}{1.718cm}}{\pgfqpoint{0.273cm}{1.753cm}}{\pgfqpoint{0.273cm}{1.789cm}}
\pgfusepath{fill}
\pgfpathmoveto{\pgfqpoint{1.345cm}{1.765cm}}
\pgfpathcurveto{\pgfqpoint{1.345cm}{1.801cm}}{\pgfqpoint{1.331cm}{1.836cm}}{\pgfqpoint{1.305cm}{1.862cm}}
\pgfpathcurveto{\pgfqpoint{1.28cm}{1.887cm}}{\pgfqpoint{1.245cm}{1.902cm}}{\pgfqpoint{1.209cm}{1.902cm}}
\pgfpathcurveto{\pgfqpoint{1.172cm}{1.902cm}}{\pgfqpoint{1.138cm}{1.887cm}}{\pgfqpoint{1.112cm}{1.862cm}}
\pgfpathcurveto{\pgfqpoint{1.087cm}{1.836cm}}{\pgfqpoint{1.072cm}{1.801cm}}{\pgfqpoint{1.072cm}{1.765cm}}
\pgfpathcurveto{\pgfqpoint{1.072cm}{1.728cm}}{\pgfqpoint{1.087cm}{1.694cm}}{\pgfqpoint{1.112cm}{1.668cm}}
\pgfpathcurveto{\pgfqpoint{1.138cm}{1.642cm}}{\pgfqpoint{1.172cm}{1.628cm}}{\pgfqpoint{1.209cm}{1.628cm}}
\pgfpathcurveto{\pgfqpoint{1.245cm}{1.628cm}}{\pgfqpoint{1.28cm}{1.642cm}}{\pgfqpoint{1.305cm}{1.668cm}}
\pgfpathcurveto{\pgfqpoint{1.331cm}{1.694cm}}{\pgfqpoint{1.345cm}{1.728cm}}{\pgfqpoint{1.345cm}{1.765cm}}
\pgfusepath{fill}
\begin{pgfscope}
\pgfsetdash{}{0cm}
\pgfsetlinewidth{0.818mm}
\pgfsetroundcap
\pgfsetroundjoin
\pgfsetmiterlimit{7.0}
\pgfpathmoveto{\pgfqpoint{0.682cm}{1.065cm}}
\pgfpathlineto{\pgfqpoint{1.246cm}{0.315cm}}
\pgfpathlineto{\pgfqpoint{1.811cm}{1.065cm}}
\pgfusepath{stroke}
\end{pgfscope}
\pgfpathmoveto{\pgfqpoint{1.948cm}{1.065cm}}
\pgfpathcurveto{\pgfqpoint{1.948cm}{1.101cm}}{\pgfqpoint{1.933cm}{1.136cm}}{\pgfqpoint{1.907cm}{1.162cm}}
\pgfpathcurveto{\pgfqpoint{1.882cm}{1.187cm}}{\pgfqpoint{1.847cm}{1.202cm}}{\pgfqpoint{1.811cm}{1.202cm}}
\pgfpathcurveto{\pgfqpoint{1.775cm}{1.202cm}}{\pgfqpoint{1.74cm}{1.187cm}}{\pgfqpoint{1.714cm}{1.162cm}}
\pgfpathcurveto{\pgfqpoint{1.689cm}{1.136cm}}{\pgfqpoint{1.674cm}{1.101cm}}{\pgfqpoint{1.674cm}{1.065cm}}
\pgfpathcurveto{\pgfqpoint{1.674cm}{1.029cm}}{\pgfqpoint{1.689cm}{0.994cm}}{\pgfqpoint{1.714cm}{0.968cm}}
\pgfpathcurveto{\pgfqpoint{1.74cm}{0.942cm}}{\pgfqpoint{1.775cm}{0.928cm}}{\pgfqpoint{1.811cm}{0.928cm}}
\pgfpathcurveto{\pgfqpoint{1.847cm}{0.928cm}}{\pgfqpoint{1.882cm}{0.942cm}}{\pgfqpoint{1.907cm}{0.968cm}}
\pgfpathcurveto{\pgfqpoint{1.933cm}{0.994cm}}{\pgfqpoint{1.948cm}{1.029cm}}{\pgfqpoint{1.948cm}{1.065cm}}
\pgfusepath{fill}
\begin{pgfscope}
\pgfsetdash{}{0cm}
\pgfsetlinewidth{0.818mm}
\pgfsetmiterlimit{7.0}
\pgfpathmoveto{\pgfqpoint{1.246cm}{0.315cm}}
\pgfpathlineto{\pgfqpoint{1.244cm}{1.061cm}}
\pgfusepath{stroke}
\end{pgfscope}
\pgfpathmoveto{\pgfqpoint{1.38cm}{1.065cm}}
\pgfpathcurveto{\pgfqpoint{1.38cm}{1.101cm}}{\pgfqpoint{1.366cm}{1.136cm}}{\pgfqpoint{1.34cm}{1.162cm}}
\pgfpathcurveto{\pgfqpoint{1.315cm}{1.187cm}}{\pgfqpoint{1.28cm}{1.202cm}}{\pgfqpoint{1.244cm}{1.202cm}}
\pgfpathcurveto{\pgfqpoint{1.207cm}{1.202cm}}{\pgfqpoint{1.173cm}{1.187cm}}{\pgfqpoint{1.147cm}{1.162cm}}
\pgfpathcurveto{\pgfqpoint{1.121cm}{1.136cm}}{\pgfqpoint{1.107cm}{1.101cm}}{\pgfqpoint{1.107cm}{1.065cm}}
\pgfpathcurveto{\pgfqpoint{1.107cm}{1.029cm}}{\pgfqpoint{1.121cm}{0.994cm}}{\pgfqpoint{1.147cm}{0.968cm}}
\pgfpathcurveto{\pgfqpoint{1.173cm}{0.942cm}}{\pgfqpoint{1.207cm}{0.928cm}}{\pgfqpoint{1.244cm}{0.928cm}}
\pgfpathcurveto{\pgfqpoint{1.28cm}{0.928cm}}{\pgfqpoint{1.315cm}{0.942cm}}{\pgfqpoint{1.34cm}{0.968cm}}
\pgfpathcurveto{\pgfqpoint{1.366cm}{0.994cm}}{\pgfqpoint{1.38cm}{1.029cm}}{\pgfqpoint{1.38cm}{1.065cm}}
\pgfusepath{fill}
\begin{pgfscope}
\pgfsetdash{}{0cm}
\pgfsetlinewidth{0.818mm}
\pgfsetmiterlimit{4.0}
\pgfpathmoveto{\pgfqpoint{1.383cm}{0.178cm}}
\pgfpathcurveto{\pgfqpoint{1.383cm}{0.214cm}}{\pgfqpoint{1.369cm}{0.249cm}}{\pgfqpoint{1.343cm}{0.275cm}}
\pgfpathcurveto{\pgfqpoint{1.317cm}{0.3cm}}{\pgfqpoint{1.283cm}{0.315cm}}{\pgfqpoint{1.246cm}{0.315cm}}
\pgfpathcurveto{\pgfqpoint{1.21cm}{0.315cm}}{\pgfqpoint{1.175cm}{0.3cm}}{\pgfqpoint{1.15cm}{0.275cm}}
\pgfpathcurveto{\pgfqpoint{1.124cm}{0.249cm}}{\pgfqpoint{1.11cm}{0.214cm}}{\pgfqpoint{1.11cm}{0.178cm}}
\pgfpathcurveto{\pgfqpoint{1.11cm}{0.141cm}}{\pgfqpoint{1.124cm}{0.107cm}}{\pgfqpoint{1.15cm}{0.081cm}}
\pgfpathcurveto{\pgfqpoint{1.175cm}{0.055cm}}{\pgfqpoint{1.21cm}{0.041cm}}{\pgfqpoint{1.246cm}{0.041cm}}
\pgfpathcurveto{\pgfqpoint{1.283cm}{0.041cm}}{\pgfqpoint{1.317cm}{0.055cm}}{\pgfqpoint{1.343cm}{0.081cm}}
\pgfpathcurveto{\pgfqpoint{1.369cm}{0.107cm}}{\pgfqpoint{1.383cm}{0.141cm}}{\pgfqpoint{1.383cm}{0.178cm}}
\pgfusepath{stroke}
\end{pgfscope}
\end{pgfscope}
\end{pgfscope}
\end{pgfscope}
\end{tikzpicture}}}_{M}, X^{\!\resizebox{!}{.8em}{
\begin{tikzpicture}
\pgfpathmoveto{\pgfqpoint{0cm}{-0.035cm}}
\pgfpathlineto{\pgfqpoint{1.976cm}{-0.035cm}}
\pgfpathlineto{\pgfqpoint{1.976cm}{1.94cm}}
\pgfpathlineto{\pgfqpoint{0cm}{1.94cm}}
\pgfpathclose
\pgfusepath{clip}
\begin{pgfscope}
\begin{pgfscope}
\pgfpathmoveto{\pgfqpoint{0cm}{-0.035cm}}
\pgfpathlineto{\pgfqpoint{1.976cm}{-0.035cm}}
\pgfpathlineto{\pgfqpoint{1.976cm}{1.94cm}}
\pgfpathlineto{\pgfqpoint{0cm}{1.94cm}}
\pgfpathclose
\pgfusepath{clip}
\begin{pgfscope}
\begin{pgfscope}
\pgfsetdash{}{0cm}
\pgfsetlinewidth{0.818mm}
\pgfsetroundcap
\pgfsetroundjoin
\pgfsetmiterlimit{7.0}
\definecolor{eps2pgf_color}{gray}{0}\pgfsetstrokecolor{eps2pgf_color}\pgfsetfillcolor{eps2pgf_color}
\pgfpathmoveto{\pgfqpoint{0.117cm}{1.815cm}}
\pgfpathlineto{\pgfqpoint{0.682cm}{1.065cm}}
\pgfpathlineto{\pgfqpoint{1.246cm}{1.815cm}}
\pgfusepath{stroke}
\end{pgfscope}
\definecolor{eps2pgf_color}{gray}{0}\pgfsetstrokecolor{eps2pgf_color}\pgfsetfillcolor{eps2pgf_color}
\pgfpathmoveto{\pgfqpoint{0.273cm}{1.789cm}}
\pgfpathcurveto{\pgfqpoint{0.273cm}{1.825cm}}{\pgfqpoint{0.259cm}{1.86cm}}{\pgfqpoint{0.233cm}{1.886cm}}
\pgfpathcurveto{\pgfqpoint{0.207cm}{1.912cm}}{\pgfqpoint{0.173cm}{1.926cm}}{\pgfqpoint{0.137cm}{1.926cm}}
\pgfpathcurveto{\pgfqpoint{0.1cm}{1.926cm}}{\pgfqpoint{0.066cm}{1.912cm}}{\pgfqpoint{0.04cm}{1.886cm}}
\pgfpathcurveto{\pgfqpoint{0.014cm}{1.86cm}}{\pgfqpoint{0cm}{1.825cm}}{\pgfqpoint{0cm}{1.789cm}}
\pgfpathcurveto{\pgfqpoint{0cm}{1.753cm}}{\pgfqpoint{0.014cm}{1.718cm}}{\pgfqpoint{0.04cm}{1.692cm}}
\pgfpathcurveto{\pgfqpoint{0.066cm}{1.667cm}}{\pgfqpoint{0.1cm}{1.652cm}}{\pgfqpoint{0.137cm}{1.652cm}}
\pgfpathcurveto{\pgfqpoint{0.173cm}{1.652cm}}{\pgfqpoint{0.207cm}{1.667cm}}{\pgfqpoint{0.233cm}{1.692cm}}
\pgfpathcurveto{\pgfqpoint{0.259cm}{1.718cm}}{\pgfqpoint{0.273cm}{1.753cm}}{\pgfqpoint{0.273cm}{1.789cm}}
\pgfusepath{fill}
\begin{pgfscope}
\pgfsetdash{}{0cm}
\pgfsetlinewidth{0.818mm}
\pgfsetmiterlimit{7.0}
\pgfpathmoveto{\pgfqpoint{0.682cm}{1.065cm}}
\pgfpathlineto{\pgfqpoint{0.679cm}{1.812cm}}
\pgfusepath{stroke}
\end{pgfscope}
\pgfpathmoveto{\pgfqpoint{0.815cm}{1.793cm}}
\pgfpathcurveto{\pgfqpoint{0.815cm}{1.829cm}}{\pgfqpoint{0.801cm}{1.864cm}}{\pgfqpoint{0.775cm}{1.89cm}}
\pgfpathcurveto{\pgfqpoint{0.75cm}{1.915cm}}{\pgfqpoint{0.715cm}{1.93cm}}{\pgfqpoint{0.679cm}{1.93cm}}
\pgfpathcurveto{\pgfqpoint{0.643cm}{1.93cm}}{\pgfqpoint{0.608cm}{1.915cm}}{\pgfqpoint{0.582cm}{1.89cm}}
\pgfpathcurveto{\pgfqpoint{0.557cm}{1.864cm}}{\pgfqpoint{0.542cm}{1.829cm}}{\pgfqpoint{0.542cm}{1.793cm}}
\pgfpathcurveto{\pgfqpoint{0.542cm}{1.756cm}}{\pgfqpoint{0.557cm}{1.722cm}}{\pgfqpoint{0.582cm}{1.696cm}}
\pgfpathcurveto{\pgfqpoint{0.608cm}{1.67cm}}{\pgfqpoint{0.643cm}{1.656cm}}{\pgfqpoint{0.679cm}{1.656cm}}
\pgfpathcurveto{\pgfqpoint{0.715cm}{1.656cm}}{\pgfqpoint{0.75cm}{1.67cm}}{\pgfqpoint{0.775cm}{1.696cm}}
\pgfpathcurveto{\pgfqpoint{0.801cm}{1.722cm}}{\pgfqpoint{0.815cm}{1.756cm}}{\pgfqpoint{0.815cm}{1.793cm}}
\pgfusepath{fill}
\pgfpathmoveto{\pgfqpoint{1.345cm}{1.765cm}}
\pgfpathcurveto{\pgfqpoint{1.345cm}{1.801cm}}{\pgfqpoint{1.331cm}{1.836cm}}{\pgfqpoint{1.305cm}{1.862cm}}
\pgfpathcurveto{\pgfqpoint{1.28cm}{1.887cm}}{\pgfqpoint{1.245cm}{1.902cm}}{\pgfqpoint{1.209cm}{1.902cm}}
\pgfpathcurveto{\pgfqpoint{1.172cm}{1.902cm}}{\pgfqpoint{1.138cm}{1.887cm}}{\pgfqpoint{1.112cm}{1.862cm}}
\pgfpathcurveto{\pgfqpoint{1.087cm}{1.836cm}}{\pgfqpoint{1.072cm}{1.801cm}}{\pgfqpoint{1.072cm}{1.765cm}}
\pgfpathcurveto{\pgfqpoint{1.072cm}{1.728cm}}{\pgfqpoint{1.087cm}{1.694cm}}{\pgfqpoint{1.112cm}{1.668cm}}
\pgfpathcurveto{\pgfqpoint{1.138cm}{1.642cm}}{\pgfqpoint{1.172cm}{1.628cm}}{\pgfqpoint{1.209cm}{1.628cm}}
\pgfpathcurveto{\pgfqpoint{1.245cm}{1.628cm}}{\pgfqpoint{1.28cm}{1.642cm}}{\pgfqpoint{1.305cm}{1.668cm}}
\pgfpathcurveto{\pgfqpoint{1.331cm}{1.694cm}}{\pgfqpoint{1.345cm}{1.728cm}}{\pgfqpoint{1.345cm}{1.765cm}}
\pgfusepath{fill}
\begin{pgfscope}
\pgfsetdash{}{0cm}
\pgfsetlinewidth{0.818mm}
\pgfsetroundcap
\pgfsetroundjoin
\pgfsetmiterlimit{7.0}
\pgfpathmoveto{\pgfqpoint{0.682cm}{1.065cm}}
\pgfpathlineto{\pgfqpoint{1.246cm}{0.315cm}}
\pgfpathlineto{\pgfqpoint{1.811cm}{1.065cm}}
\pgfusepath{stroke}
\end{pgfscope}
\pgfpathmoveto{\pgfqpoint{1.948cm}{1.065cm}}
\pgfpathcurveto{\pgfqpoint{1.948cm}{1.101cm}}{\pgfqpoint{1.933cm}{1.136cm}}{\pgfqpoint{1.907cm}{1.162cm}}
\pgfpathcurveto{\pgfqpoint{1.882cm}{1.187cm}}{\pgfqpoint{1.847cm}{1.202cm}}{\pgfqpoint{1.811cm}{1.202cm}}
\pgfpathcurveto{\pgfqpoint{1.775cm}{1.202cm}}{\pgfqpoint{1.74cm}{1.187cm}}{\pgfqpoint{1.714cm}{1.162cm}}
\pgfpathcurveto{\pgfqpoint{1.689cm}{1.136cm}}{\pgfqpoint{1.674cm}{1.101cm}}{\pgfqpoint{1.674cm}{1.065cm}}
\pgfpathcurveto{\pgfqpoint{1.674cm}{1.029cm}}{\pgfqpoint{1.689cm}{0.994cm}}{\pgfqpoint{1.714cm}{0.968cm}}
\pgfpathcurveto{\pgfqpoint{1.74cm}{0.942cm}}{\pgfqpoint{1.775cm}{0.928cm}}{\pgfqpoint{1.811cm}{0.928cm}}
\pgfpathcurveto{\pgfqpoint{1.847cm}{0.928cm}}{\pgfqpoint{1.882cm}{0.942cm}}{\pgfqpoint{1.907cm}{0.968cm}}
\pgfpathcurveto{\pgfqpoint{1.933cm}{0.994cm}}{\pgfqpoint{1.948cm}{1.029cm}}{\pgfqpoint{1.948cm}{1.065cm}}
\pgfusepath{fill}
\begin{pgfscope}
\pgfsetdash{}{0cm}
\pgfsetlinewidth{0.818mm}
\pgfsetmiterlimit{7.0}
\pgfpathmoveto{\pgfqpoint{1.246cm}{0.315cm}}
\pgfpathlineto{\pgfqpoint{1.244cm}{1.061cm}}
\pgfusepath{stroke}
\end{pgfscope}
\pgfpathmoveto{\pgfqpoint{1.38cm}{1.065cm}}
\pgfpathcurveto{\pgfqpoint{1.38cm}{1.101cm}}{\pgfqpoint{1.366cm}{1.136cm}}{\pgfqpoint{1.34cm}{1.162cm}}
\pgfpathcurveto{\pgfqpoint{1.315cm}{1.187cm}}{\pgfqpoint{1.28cm}{1.202cm}}{\pgfqpoint{1.244cm}{1.202cm}}
\pgfpathcurveto{\pgfqpoint{1.207cm}{1.202cm}}{\pgfqpoint{1.173cm}{1.187cm}}{\pgfqpoint{1.147cm}{1.162cm}}
\pgfpathcurveto{\pgfqpoint{1.121cm}{1.136cm}}{\pgfqpoint{1.107cm}{1.101cm}}{\pgfqpoint{1.107cm}{1.065cm}}
\pgfpathcurveto{\pgfqpoint{1.107cm}{1.029cm}}{\pgfqpoint{1.121cm}{0.994cm}}{\pgfqpoint{1.147cm}{0.968cm}}
\pgfpathcurveto{\pgfqpoint{1.173cm}{0.942cm}}{\pgfqpoint{1.207cm}{0.928cm}}{\pgfqpoint{1.244cm}{0.928cm}}
\pgfpathcurveto{\pgfqpoint{1.28cm}{0.928cm}}{\pgfqpoint{1.315cm}{0.942cm}}{\pgfqpoint{1.34cm}{0.968cm}}
\pgfpathcurveto{\pgfqpoint{1.366cm}{0.994cm}}{\pgfqpoint{1.38cm}{1.029cm}}{\pgfqpoint{1.38cm}{1.065cm}}
\pgfusepath{fill}
\begin{pgfscope}
\pgfsetdash{}{0cm}
\pgfsetlinewidth{0.818mm}
\pgfsetmiterlimit{4.0}
\pgfpathmoveto{\pgfqpoint{1.383cm}{0.178cm}}
\pgfpathcurveto{\pgfqpoint{1.383cm}{0.214cm}}{\pgfqpoint{1.369cm}{0.249cm}}{\pgfqpoint{1.343cm}{0.275cm}}
\pgfpathcurveto{\pgfqpoint{1.317cm}{0.3cm}}{\pgfqpoint{1.283cm}{0.315cm}}{\pgfqpoint{1.246cm}{0.315cm}}
\pgfpathcurveto{\pgfqpoint{1.21cm}{0.315cm}}{\pgfqpoint{1.175cm}{0.3cm}}{\pgfqpoint{1.15cm}{0.275cm}}
\pgfpathcurveto{\pgfqpoint{1.124cm}{0.249cm}}{\pgfqpoint{1.11cm}{0.214cm}}{\pgfqpoint{1.11cm}{0.178cm}}
\pgfpathcurveto{\pgfqpoint{1.11cm}{0.141cm}}{\pgfqpoint{1.124cm}{0.107cm}}{\pgfqpoint{1.15cm}{0.081cm}}
\pgfpathcurveto{\pgfqpoint{1.175cm}{0.055cm}}{\pgfqpoint{1.21cm}{0.041cm}}{\pgfqpoint{1.246cm}{0.041cm}}
\pgfpathcurveto{\pgfqpoint{1.283cm}{0.041cm}}{\pgfqpoint{1.317cm}{0.055cm}}{\pgfqpoint{1.343cm}{0.081cm}}
\pgfpathcurveto{\pgfqpoint{1.369cm}{0.107cm}}{\pgfqpoint{1.383cm}{0.141cm}}{\pgfqpoint{1.383cm}{0.178cm}}
\pgfusepath{stroke}
\end{pgfscope}
\end{pgfscope}
\end{pgfscope}
\end{pgfscope}
\end{tikzpicture}}}_{M}
\end{equation}
given by the formulas above, such that if $\tau$ denotes one of the distributions in \eqref{eq:r45} and $\tau_{M}$ is the associated periodic version from \eqref{eq:r45M}, then $\tau\in \CC^{\alpha_{\tau}}(\rho^{\sigma})$ and $\tau_{M}\in \CC^{\alpha_{\tau}}(\mathbb{T}^{5}_{M})$ for $\alpha_{\tau}$  given by Table \ref{t:reg} and every $\kappa,\sigma>0$. Moreover,
$\tau_{M }\to \tau$ in $\CC^{\alpha_{\tau}}(\rho^{\sigma})$ a.s. as $M\to\infty$.
 \end{theorem}
 
 \begin{proof}
 Apart form the higher complexity of the terms involved in the $d=5$ case, the analysis proceeds like in Theorem~\ref{thm:renorm}. The various stochastic objects can be written as multiple iterated Wiener integrals and renormalizations accounts for cancellations of certain terms in the associated kernels. In the periodic and parabolic setting this analysis has already been performed several times with small variations, for example in~\cite{CC},~\cite{mourrat_construction_2016} and more recently in~\cite{furlan_weak_2017} and in~\cite{gubinelli_kpz_2017} for the KPZ equation. Estimation in weighted Besov spaces and convergence of the periodic to the non-periodic versions proceed like in the $\R^4$ case. 
 \end{proof}

\subsection{Space-time white noise}
\label{ssec:renorm-par}

If $\xi$ is a space white noise on $\R\times\R^{d}$, i.e. a family of centered Gaussian random variables $\{\xi(h);\,h\in L^{2}(\R\times\R^{d}) \}$ such that
$$
\mathbb{E}[\xi(h)^{2}]=\|h\|^{2}_{L^{2}},
$$
then we may define its periodization $\xi_{M}$ on $\mathbb{T}^{d}_{M}=\left[-\frac{M}{2},\frac{M}{2}\right]^{d}$  by
$$
\xi_{M}(h):=\xi(h_{M}),\qquad\text{where }\ h_{M}(t,x)=\mathbf{1}_{[-\frac{M}{2},\frac{M}{2}]^{d}}(x)\sum_{y\in M\mathbb{Z}^{d}}h(t,x+y).
$$
Our construction of solutions to the parabolic $\Phi^{4}$ model in Section \ref{sec:42} and Section \ref{sec:43} relies on  a smooth and space periodic approximation $\xi_{\varepsilon}$ of the driving space-time white noise $\xi$, defined on the torus of size $M=\frac 1\varepsilon$. To be more precise, let $\xi_{\varepsilon}$ be a periodic version of a space-time mollification of $\xi$ defined on $\R\times\mathbb{T}^{2}_{1/\varepsilon}$ and let $X,X_{\varepsilon}$ be stationary solutions to
\begin{equation*}
 \LL X = \xi, \qquad  \LL X_{\varepsilon} = \xi_{\varepsilon},
\end{equation*}
 and 
$$
 \llbracket X^2 \rrbracket  \assign \lim_{\varepsilon\to 0} X_{\varepsilon}^2 - a_{\varepsilon}, \qquad   \llbracket X^3 \rrbracket\assign\lim_{\varepsilon\to 0} X_{\varepsilon}^3 - 3 a_{\varepsilon} X_{\varepsilon},
 $$
 where again we can take $a_{\varepsilon}= \mathbbm{E}[X_{\varepsilon}^2(0,0)]$ is a diverging constant and the limits are understood in a suitable Besov space a.s. More precisely, the following result holds.

 \begin{theorem}\label{thm:renorm42}
Let $d=2$. Let $\rho(t,x)=\langle (t,x)\rangle^{-\nu}$ for some $\nu>0$. There exists a sequence of diverging constants $(a_{\varepsilon})_{\varepsilon\in (0,1)}$ and random distributions $X,\llbracket X^{2}\rrbracket,\llbracket X^{3}\rrbracket$  such that for all $\kappa,\sigma>0$  it holds
$$
\|X\|_{C\CC^{-\kappa}(\rho^{\sigma})},\|\llbracket X^{2}\rrbracket\|_{C\CC^{-\kappa}(\rho^{\sigma})}
,\|\llbracket X^{3}\rrbracket\|_{C\CC^{-\kappa}(\rho^{\sigma})}\lesssim 1,
$$
and
$$
 \llbracket X^2 \rrbracket  = \lim_{\varepsilon\to 0} X_{\varepsilon}^2 - a_{\varepsilon}, \qquad   \llbracket X^3 \rrbracket=\lim_{\varepsilon\to 0} X_{\varepsilon}^3 - 3 a_{\varepsilon} X_{\varepsilon},
 $$
where the limit is understood in $C\CC^{-\kappa}(\rho^{\sigma})$ a.s. as $\varepsilon\to\infty$.
\end{theorem}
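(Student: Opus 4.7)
The plan is to adapt the Wiener chaos / weighted Besov strategy used in Theorem~\ref{thm:renorm} to the parabolic setting on $\R\times\R^2$. The extra ingredient compared to the elliptic case is that the covariance of the stationary solution $X$ to $\LL X=\xi$ involves the heat kernel rather than the Green function of $\Q$, but the overall architecture of the proof is the same: represent everything as a Wiener integral against a space-time white noise $W$ on $\R\times\R^2$, estimate Littlewood--Paley blocks in $L^2$ via explicit kernel computations, upgrade to $L^p$ by Gaussian hypercontractivity, then to $L^\infty(\rho^\sigma)$ via Bernstein and integration against the weight, and finally pass to the limit.

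First I would write the stationary solution to $\LL X=\xi$ in Fourier variables as
\[
X(t,x)=\int_{\R\times\R^2} e^{2\pi i (\theta\cdot x)}\, \frac{e^{-(\mu+|\theta|^2)(t-s)}\mathbf{1}_{\{s\le t\}}}{\,1\,}\, W(\mathrm{d} s\,\mathrm{d}\theta),
\]
so that the covariance of $X$ is computed explicitly, and the Wick powers $\llbracket X^k\rrbracket$ for $k=2,3$ are expressed as $k$-fold iterated Wiener integrals. The renormalization constant $a_\varepsilon=\mathbbm{E}[X_\varepsilon^2(0,0)]$ is a logarithmically divergent constant in $d=2$ accounting for the removal of the diagonal of the kernel in the Wick formula. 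For each fixed $(t,x)$ I would compute $\mathbbm{E}[|\Delta_k \llbracket X^j_\varepsilon\rrbracket(t,x)|^2]$ by the standard formula and bound it, uniformly in $(t,x,\varepsilon)$, by $C\, 2^{2k\kappa_0}$ for some $\kappa_0<\kappa/2$; in two space dimensions all relevant kernel integrals converge, which is exactly why $d=2$ is sub-critical.

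Second, hypercontractivity in the Wiener chaos gives $\mathbbm{E}[|\Delta_k\llbracket X^j_\varepsilon\rrbracket(t,x)|^p]\lesssim 2^{2k\kappa_0 p/2}$ uniformly in $(t,x)$. Integrating against $\rho(t,x)^{\sigma p}$, which is integrable on $\R\times\R^2$ for $p$ large enough, yields
\[
\mathbbm{E}\|\Delta_k\llbracket X^j_\varepsilon\rrbracket\|_{L^p(\rho^\sigma)}^p \lesssim 2^{k\kappa_0 p},
\]
and Bernstein's inequality upgrades this to the $L^\infty(\rho^\sigma)$ norm paying an extra factor $2^{k(d+1)/p}$, which is arbitrarily small by taking $p$ large. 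Combined with Kolmogorov / Garsia--Rodemich--Rumsey type arguments in the time variable (using the fact that time increments of the stochastic convolution on a dyadic block of scale $2^{-2k}$ behave like a parabolic Hölder function), I get the path-regularity statement $\llbracket X^j_\varepsilon\rrbracket\in C\CC^{-\kappa}(\rho^\sigma)$ uniformly in $\varepsilon$.

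Finally, convergence as $\varepsilon\to 0$ is obtained by coupling all $\xi_\varepsilon$ to the same white noise $\xi$ via mollification and applying the same $L^2$ computation to the difference of kernels, which by dominated convergence tends to zero. The main technical obstacle is purely bookkeeping: controlling the time direction together with the spatial weight in a way that ensures continuity in $t$ with values in $\CC^{-\kappa}(\rho_t^\sigma)$, which will require either a direct Kolmogorov argument applied to $t\mapsto \llbracket X^j_\varepsilon\rrbracket(t,\cdot)$ in $\CC^{-\kappa-\delta}(\rho^\sigma)$ together with an interpolation with the uniform bound in $\CC^{-\kappa/2}(\rho^\sigma)$, or a direct use of the parabolic scaling in the kernel estimates. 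Everything else is a routine adaptation of the proof of Theorem~\ref{thm:renorm}.
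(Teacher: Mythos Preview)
Your proposal is correct and follows essentially the same route as the paper, which itself refers back to Theorem~\ref{thm:renorm} for the Wiener chaos / hypercontractivity / Bernstein core and only elaborates on one additional point. That point is the handling of the \emph{infinite} time horizon, which you identify as the main bookkeeping issue but leave somewhat open. The paper's mechanism is concrete: one first proves increment bounds of the form
\[
\mathbb{E}\bigl[\|\rho^\sigma(t,\cdot)Y(t,\cdot)-\rho^\sigma(s,\cdot)Y(s,\cdot)\|_{\CC^{-\kappa}}^p\bigr]\lesssim |t-s|^{\delta p}\langle s\rangle^{-\beta p},
\]
uniformly for $0\le s\le t\le s+1$, where the factor $\langle s\rangle^{-\beta p}$ comes from the time decay of the space--time weight $\rho$. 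Kolmogorov on each unit interval $[L,L+1]$ then gives $\mathbb{E}\|\rho^\sigma Y\|_{C^{\delta'}([L,L+1];\CC^{-\kappa})}^p\lesssim L^{-\beta p}$, and for $p$ large the series $\sum_L L^{-\beta p}$ converges, so a gluing argument yields finite $L^p$ moments of the global norm $\|\rho^\sigma Y\|_{C^{\delta'}(\R_+;\CC^{-\kappa})}$. Convergence of the approximations is handled by the same estimate applied to $Y_\varepsilon-Y$. Your idea of integrating $\rho(t,x)^{\sigma p}$ jointly in $(t,x)$ captures the same summability, but on its own only gives an $L^p$-in-time bound rather than continuity; making the Kolmogorov-plus-summation step explicit as above closes the argument.
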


 \begin{proof} The proof proceeds like in Theorem~\ref{thm:renorm} in the proof of which we also made reference to the relevant literature. We would just like to comment on how to obtain existence for all times within the claimed weighted space. Let $Y$ be one of the random fields considered in the theorem and $Y_\varepsilon$ the corresponding approximation. By standard estimates one obtains bounds of the form
 $$
 \mathbbm{E}[\|\rho^\sigma(t,\cdot) Y(t,\cdot)-\rho^\sigma(s,\cdot) Y(s,\cdot) \|_{\CC^{-\kappa}}^p]\lesssim |t-s|^{\delta p} \langle s \rangle^{-\beta p},
 $$ 
 for some small $\delta,\beta>0$ and large $p$, uniformly for $0\le s \le t \le s +1$.
 Standard Kolomogorov criterion can be applied to obtain that 
 $$
 \mathbbm{E}[\|\rho^\sigma Y \|_{C^{\delta'}([L,L+1];\CC^{-\kappa})}^p]\lesssim L^{-\beta p},
 $$ 
 for all $L\ge 1$ and some $\delta'>0$. Finally if $p$ is large enough this shows that the random variable
 $
 \sum_{L=0}^\infty \|\rho^\sigma Y \|_{C^{\delta'}([L,L+1];\CC^{-\kappa})}^p
 $ 
 has finite expectation. Finally a simple gluing argument implies that the random variable $\|\rho^\sigma Y \|_{C^{\delta'}(\R_+;\CC^{-\kappa})}$ has also finite $L^p$ moments. Weighted space convergence of the approximation $Y_\varepsilon$ to $Y$ can be handled similarly since we can establish that
 $$
\sup_L L^{\beta p}\mathbbm{E}[\|\rho^\sigma Y_\varepsilon - \rho^\sigma Y \|_{C^{\delta'}([L,L+1];\CC^{-\kappa})}^p]\lesssim  o_\varepsilon(1),
 $$ 
from which we obtain easily the convergence in the weighted norm $C^{\delta'}\CC^{-\kappa}(\rho^\sigma)$ as $\varepsilon \to 0$. 
 \end{proof}

Similarly to the elliptic 5 dimensional case, we define
\[ \LL X^{\!\resizebox{0.6em}{!}{
\begin{tikzpicture}
\pgfpathmoveto{\pgfqpoint{0cm}{-0.035cm}}
\pgfpathlineto{\pgfqpoint{1.376cm}{-0.035cm}}
\pgfpathlineto{\pgfqpoint{1.376cm}{1.552cm}}
\pgfpathlineto{\pgfqpoint{0cm}{1.552cm}}
\pgfpathclose
\pgfusepath{clip}
\begin{pgfscope}
\begin{pgfscope}
\pgfpathmoveto{\pgfqpoint{0cm}{-0.035cm}}
\pgfpathlineto{\pgfqpoint{1.376cm}{-0.035cm}}
\pgfpathlineto{\pgfqpoint{1.376cm}{1.552cm}}
\pgfpathlineto{\pgfqpoint{0cm}{1.552cm}}
\pgfpathclose
\pgfusepath{clip}
\begin{pgfscope}
\begin{pgfscope}
\pgfsetdash{}{0cm}
\pgfsetlinewidth{0.818mm}
\pgfsetroundcap
\pgfsetroundjoin
\pgfsetmiterlimit{7.0}
\definecolor{eps2pgf_color}{gray}{0}\pgfsetstrokecolor{eps2pgf_color}\pgfsetfillcolor{eps2pgf_color}
\pgfpathmoveto{\pgfqpoint{0.117cm}{1.421cm}}
\pgfpathlineto{\pgfqpoint{0.682cm}{0.671cm}}
\pgfpathlineto{\pgfqpoint{1.246cm}{1.421cm}}
\pgfusepath{stroke}
\end{pgfscope}
\definecolor{eps2pgf_color}{gray}{0}\pgfsetstrokecolor{eps2pgf_color}\pgfsetfillcolor{eps2pgf_color}
\pgfpathmoveto{\pgfqpoint{0.273cm}{1.395cm}}
\pgfpathcurveto{\pgfqpoint{0.273cm}{1.432cm}}{\pgfqpoint{0.259cm}{1.467cm}}{\pgfqpoint{0.233cm}{1.492cm}}
\pgfpathcurveto{\pgfqpoint{0.207cm}{1.518cm}}{\pgfqpoint{0.173cm}{1.532cm}}{\pgfqpoint{0.137cm}{1.532cm}}
\pgfpathcurveto{\pgfqpoint{0.1cm}{1.532cm}}{\pgfqpoint{0.066cm}{1.518cm}}{\pgfqpoint{0.04cm}{1.492cm}}
\pgfpathcurveto{\pgfqpoint{0.014cm}{1.467cm}}{\pgfqpoint{0cm}{1.432cm}}{\pgfqpoint{0cm}{1.395cm}}
\pgfpathcurveto{\pgfqpoint{0cm}{1.359cm}}{\pgfqpoint{0.014cm}{1.324cm}}{\pgfqpoint{0.04cm}{1.299cm}}
\pgfpathcurveto{\pgfqpoint{0.066cm}{1.273cm}}{\pgfqpoint{0.1cm}{1.258cm}}{\pgfqpoint{0.137cm}{1.258cm}}
\pgfpathcurveto{\pgfqpoint{0.173cm}{1.258cm}}{\pgfqpoint{0.207cm}{1.273cm}}{\pgfqpoint{0.233cm}{1.299cm}}
\pgfpathcurveto{\pgfqpoint{0.259cm}{1.324cm}}{\pgfqpoint{0.273cm}{1.359cm}}{\pgfqpoint{0.273cm}{1.395cm}}
\pgfusepath{fill}
\begin{pgfscope}
\pgfsetdash{}{0cm}
\pgfsetlinewidth{0.818mm}
\pgfsetmiterlimit{7.0}
\pgfpathmoveto{\pgfqpoint{0.682cm}{0.671cm}}
\pgfpathlineto{\pgfqpoint{0.679cm}{1.418cm}}
\pgfusepath{stroke}
\end{pgfscope}
\pgfpathmoveto{\pgfqpoint{0.815cm}{1.399cm}}
\pgfpathcurveto{\pgfqpoint{0.815cm}{1.435cm}}{\pgfqpoint{0.801cm}{1.47cm}}{\pgfqpoint{0.775cm}{1.496cm}}
\pgfpathcurveto{\pgfqpoint{0.75cm}{1.521cm}}{\pgfqpoint{0.715cm}{1.536cm}}{\pgfqpoint{0.679cm}{1.536cm}}
\pgfpathcurveto{\pgfqpoint{0.643cm}{1.536cm}}{\pgfqpoint{0.608cm}{1.521cm}}{\pgfqpoint{0.582cm}{1.496cm}}
\pgfpathcurveto{\pgfqpoint{0.557cm}{1.47cm}}{\pgfqpoint{0.542cm}{1.435cm}}{\pgfqpoint{0.542cm}{1.399cm}}
\pgfpathcurveto{\pgfqpoint{0.542cm}{1.363cm}}{\pgfqpoint{0.557cm}{1.328cm}}{\pgfqpoint{0.582cm}{1.302cm}}
\pgfpathcurveto{\pgfqpoint{0.608cm}{1.276cm}}{\pgfqpoint{0.643cm}{1.262cm}}{\pgfqpoint{0.679cm}{1.262cm}}
\pgfpathcurveto{\pgfqpoint{0.715cm}{1.262cm}}{\pgfqpoint{0.75cm}{1.276cm}}{\pgfqpoint{0.775cm}{1.302cm}}
\pgfpathcurveto{\pgfqpoint{0.801cm}{1.328cm}}{\pgfqpoint{0.815cm}{1.363cm}}{\pgfqpoint{0.815cm}{1.399cm}}
\pgfusepath{fill}
\pgfpathmoveto{\pgfqpoint{1.345cm}{1.371cm}}
\pgfpathcurveto{\pgfqpoint{1.345cm}{1.408cm}}{\pgfqpoint{1.331cm}{1.442cm}}{\pgfqpoint{1.305cm}{1.468cm}}
\pgfpathcurveto{\pgfqpoint{1.28cm}{1.494cm}}{\pgfqpoint{1.245cm}{1.508cm}}{\pgfqpoint{1.209cm}{1.508cm}}
\pgfpathcurveto{\pgfqpoint{1.172cm}{1.508cm}}{\pgfqpoint{1.138cm}{1.494cm}}{\pgfqpoint{1.112cm}{1.468cm}}
\pgfpathcurveto{\pgfqpoint{1.087cm}{1.442cm}}{\pgfqpoint{1.072cm}{1.408cm}}{\pgfqpoint{1.072cm}{1.371cm}}
\pgfpathcurveto{\pgfqpoint{1.072cm}{1.335cm}}{\pgfqpoint{1.087cm}{1.3cm}}{\pgfqpoint{1.112cm}{1.274cm}}
\pgfpathcurveto{\pgfqpoint{1.138cm}{1.249cm}}{\pgfqpoint{1.172cm}{1.234cm}}{\pgfqpoint{1.209cm}{1.234cm}}
\pgfpathcurveto{\pgfqpoint{1.245cm}{1.234cm}}{\pgfqpoint{1.28cm}{1.249cm}}{\pgfqpoint{1.305cm}{1.274cm}}
\pgfpathcurveto{\pgfqpoint{1.331cm}{1.3cm}}{\pgfqpoint{1.345cm}{1.335cm}}{\pgfqpoint{1.345cm}{1.371cm}}
\pgfusepath{fill}
\begin{pgfscope}
\pgfsetdash{}{0cm}
\pgfsetlinewidth{0.818mm}
\pgfsetroundcap
\pgfsetmiterlimit{4.0}
\pgfpathmoveto{\pgfqpoint{0.682cm}{0.671cm}}
\pgfpathlineto{\pgfqpoint{0.682cm}{0.042cm}}
\pgfusepath{stroke}
\end{pgfscope}
\end{pgfscope}
\end{pgfscope}
\end{pgfscope}
\end{tikzpicture}}} = \llbracket X^3 \rrbracket,\quadX^{\!\resizebox{0.6em}{!}{
\begin{tikzpicture}
\pgfpathmoveto{\pgfqpoint{0cm}{-0.035cm}}
\pgfpathlineto{\pgfqpoint{1.376cm}{-0.035cm}}
\pgfpathlineto{\pgfqpoint{1.376cm}{1.552cm}}
\pgfpathlineto{\pgfqpoint{0cm}{1.552cm}}
\pgfpathclose
\pgfusepath{clip}
\begin{pgfscope}
\begin{pgfscope}
\pgfpathmoveto{\pgfqpoint{0cm}{-0.035cm}}
\pgfpathlineto{\pgfqpoint{1.376cm}{-0.035cm}}
\pgfpathlineto{\pgfqpoint{1.376cm}{1.552cm}}
\pgfpathlineto{\pgfqpoint{0cm}{1.552cm}}
\pgfpathclose
\pgfusepath{clip}
\begin{pgfscope}
\begin{pgfscope}
\pgfsetdash{}{0cm}
\pgfsetlinewidth{0.818mm}
\pgfsetroundcap
\pgfsetroundjoin
\pgfsetmiterlimit{7.0}
\definecolor{eps2pgf_color}{gray}{0}\pgfsetstrokecolor{eps2pgf_color}\pgfsetfillcolor{eps2pgf_color}
\pgfpathmoveto{\pgfqpoint{0.117cm}{1.421cm}}
\pgfpathlineto{\pgfqpoint{0.682cm}{0.671cm}}
\pgfpathlineto{\pgfqpoint{1.246cm}{1.421cm}}
\pgfusepath{stroke}
\end{pgfscope}
\definecolor{eps2pgf_color}{gray}{0}\pgfsetstrokecolor{eps2pgf_color}\pgfsetfillcolor{eps2pgf_color}
\pgfpathmoveto{\pgfqpoint{0.273cm}{1.395cm}}
\pgfpathcurveto{\pgfqpoint{0.273cm}{1.432cm}}{\pgfqpoint{0.259cm}{1.467cm}}{\pgfqpoint{0.233cm}{1.492cm}}
\pgfpathcurveto{\pgfqpoint{0.207cm}{1.518cm}}{\pgfqpoint{0.173cm}{1.532cm}}{\pgfqpoint{0.137cm}{1.532cm}}
\pgfpathcurveto{\pgfqpoint{0.1cm}{1.532cm}}{\pgfqpoint{0.066cm}{1.518cm}}{\pgfqpoint{0.04cm}{1.492cm}}
\pgfpathcurveto{\pgfqpoint{0.014cm}{1.467cm}}{\pgfqpoint{0cm}{1.432cm}}{\pgfqpoint{0cm}{1.395cm}}
\pgfpathcurveto{\pgfqpoint{0cm}{1.359cm}}{\pgfqpoint{0.014cm}{1.324cm}}{\pgfqpoint{0.04cm}{1.299cm}}
\pgfpathcurveto{\pgfqpoint{0.066cm}{1.273cm}}{\pgfqpoint{0.1cm}{1.258cm}}{\pgfqpoint{0.137cm}{1.258cm}}
\pgfpathcurveto{\pgfqpoint{0.173cm}{1.258cm}}{\pgfqpoint{0.207cm}{1.273cm}}{\pgfqpoint{0.233cm}{1.299cm}}
\pgfpathcurveto{\pgfqpoint{0.259cm}{1.324cm}}{\pgfqpoint{0.273cm}{1.359cm}}{\pgfqpoint{0.273cm}{1.395cm}}
\pgfusepath{fill}
\begin{pgfscope}
\pgfsetdash{}{0cm}
\pgfsetlinewidth{0.818mm}
\pgfsetmiterlimit{7.0}
\pgfpathmoveto{\pgfqpoint{0.682cm}{0.671cm}}
\pgfpathlineto{\pgfqpoint{0.679cm}{1.418cm}}
\pgfusepath{stroke}
\end{pgfscope}
\pgfpathmoveto{\pgfqpoint{0.815cm}{1.399cm}}
\pgfpathcurveto{\pgfqpoint{0.815cm}{1.435cm}}{\pgfqpoint{0.801cm}{1.47cm}}{\pgfqpoint{0.775cm}{1.496cm}}
\pgfpathcurveto{\pgfqpoint{0.75cm}{1.521cm}}{\pgfqpoint{0.715cm}{1.536cm}}{\pgfqpoint{0.679cm}{1.536cm}}
\pgfpathcurveto{\pgfqpoint{0.643cm}{1.536cm}}{\pgfqpoint{0.608cm}{1.521cm}}{\pgfqpoint{0.582cm}{1.496cm}}
\pgfpathcurveto{\pgfqpoint{0.557cm}{1.47cm}}{\pgfqpoint{0.542cm}{1.435cm}}{\pgfqpoint{0.542cm}{1.399cm}}
\pgfpathcurveto{\pgfqpoint{0.542cm}{1.363cm}}{\pgfqpoint{0.557cm}{1.328cm}}{\pgfqpoint{0.582cm}{1.302cm}}
\pgfpathcurveto{\pgfqpoint{0.608cm}{1.276cm}}{\pgfqpoint{0.643cm}{1.262cm}}{\pgfqpoint{0.679cm}{1.262cm}}
\pgfpathcurveto{\pgfqpoint{0.715cm}{1.262cm}}{\pgfqpoint{0.75cm}{1.276cm}}{\pgfqpoint{0.775cm}{1.302cm}}
\pgfpathcurveto{\pgfqpoint{0.801cm}{1.328cm}}{\pgfqpoint{0.815cm}{1.363cm}}{\pgfqpoint{0.815cm}{1.399cm}}
\pgfusepath{fill}
\pgfpathmoveto{\pgfqpoint{1.345cm}{1.371cm}}
\pgfpathcurveto{\pgfqpoint{1.345cm}{1.408cm}}{\pgfqpoint{1.331cm}{1.442cm}}{\pgfqpoint{1.305cm}{1.468cm}}
\pgfpathcurveto{\pgfqpoint{1.28cm}{1.494cm}}{\pgfqpoint{1.245cm}{1.508cm}}{\pgfqpoint{1.209cm}{1.508cm}}
\pgfpathcurveto{\pgfqpoint{1.172cm}{1.508cm}}{\pgfqpoint{1.138cm}{1.494cm}}{\pgfqpoint{1.112cm}{1.468cm}}
\pgfpathcurveto{\pgfqpoint{1.087cm}{1.442cm}}{\pgfqpoint{1.072cm}{1.408cm}}{\pgfqpoint{1.072cm}{1.371cm}}
\pgfpathcurveto{\pgfqpoint{1.072cm}{1.335cm}}{\pgfqpoint{1.087cm}{1.3cm}}{\pgfqpoint{1.112cm}{1.274cm}}
\pgfpathcurveto{\pgfqpoint{1.138cm}{1.249cm}}{\pgfqpoint{1.172cm}{1.234cm}}{\pgfqpoint{1.209cm}{1.234cm}}
\pgfpathcurveto{\pgfqpoint{1.245cm}{1.234cm}}{\pgfqpoint{1.28cm}{1.249cm}}{\pgfqpoint{1.305cm}{1.274cm}}
\pgfpathcurveto{\pgfqpoint{1.331cm}{1.3cm}}{\pgfqpoint{1.345cm}{1.335cm}}{\pgfqpoint{1.345cm}{1.371cm}}
\pgfusepath{fill}
\begin{pgfscope}
\pgfsetdash{}{0cm}
\pgfsetlinewidth{0.818mm}
\pgfsetroundcap
\pgfsetmiterlimit{4.0}
\pgfpathmoveto{\pgfqpoint{0.682cm}{0.671cm}}
\pgfpathlineto{\pgfqpoint{0.682cm}{0.042cm}}
\pgfusepath{stroke}
\end{pgfscope}
\end{pgfscope}
\end{pgfscope}
\end{pgfscope}
\end{tikzpicture}}}(0)=0,   \qquad \LL
   X^{\!\resizebox{0.6em}{!}{
\begin{tikzpicture}
\pgfpathmoveto{\pgfqpoint{0cm}{0cm}}
\pgfpathlineto{\pgfqpoint{1.376cm}{0cm}}
\pgfpathlineto{\pgfqpoint{1.376cm}{1.588cm}}
\pgfpathlineto{\pgfqpoint{0cm}{1.588cm}}
\pgfpathclose
\pgfusepath{clip}
\begin{pgfscope}
\begin{pgfscope}
\pgfpathmoveto{\pgfqpoint{0cm}{0cm}}
\pgfpathlineto{\pgfqpoint{1.376cm}{0cm}}
\pgfpathlineto{\pgfqpoint{1.376cm}{1.588cm}}
\pgfpathlineto{\pgfqpoint{0cm}{1.588cm}}
\pgfpathclose
\pgfusepath{clip}
\begin{pgfscope}
\begin{pgfscope}
\definecolor{eps2pgf_color}{gray}{0.976471}\pgfsetstrokecolor{eps2pgf_color}\pgfsetfillcolor{eps2pgf_color}
\pgfpathmoveto{\pgfqpoint{0cm}{0cm}}
\pgfpathlineto{\pgfqpoint{1.376cm}{0cm}}
\pgfpathlineto{\pgfqpoint{1.376cm}{1.588cm}}
\pgfpathlineto{\pgfqpoint{0cm}{1.588cm}}
\pgfpathclose
\pgfusepath{fill}
\end{pgfscope}
\begin{pgfscope}
\pgfsetdash{}{0cm}
\pgfsetlinewidth{0.818mm}
\pgfsetroundcap
\pgfsetroundjoin
\pgfsetmiterlimit{7.0}
\definecolor{eps2pgf_color}{gray}{0}\pgfsetstrokecolor{eps2pgf_color}\pgfsetfillcolor{eps2pgf_color}
\pgfpathmoveto{\pgfqpoint{0.117cm}{1.476cm}}
\pgfpathlineto{\pgfqpoint{0.682cm}{0.726cm}}
\pgfpathlineto{\pgfqpoint{1.246cm}{1.476cm}}
\pgfusepath{stroke}
\end{pgfscope}
\definecolor{eps2pgf_color}{gray}{0}\pgfsetstrokecolor{eps2pgf_color}\pgfsetfillcolor{eps2pgf_color}
\pgfpathmoveto{\pgfqpoint{0.273cm}{1.451cm}}
\pgfpathcurveto{\pgfqpoint{0.273cm}{1.487cm}}{\pgfqpoint{0.259cm}{1.522cm}}{\pgfqpoint{0.233cm}{1.547cm}}
\pgfpathcurveto{\pgfqpoint{0.207cm}{1.573cm}}{\pgfqpoint{0.173cm}{1.588cm}}{\pgfqpoint{0.137cm}{1.588cm}}
\pgfpathcurveto{\pgfqpoint{0.1cm}{1.588cm}}{\pgfqpoint{0.066cm}{1.573cm}}{\pgfqpoint{0.04cm}{1.547cm}}
\pgfpathcurveto{\pgfqpoint{0.014cm}{1.522cm}}{\pgfqpoint{0cm}{1.487cm}}{\pgfqpoint{0cm}{1.451cm}}
\pgfpathcurveto{\pgfqpoint{0cm}{1.414cm}}{\pgfqpoint{0.014cm}{1.379cm}}{\pgfqpoint{0.04cm}{1.354cm}}
\pgfpathcurveto{\pgfqpoint{0.066cm}{1.328cm}}{\pgfqpoint{0.1cm}{1.314cm}}{\pgfqpoint{0.137cm}{1.314cm}}
\pgfpathcurveto{\pgfqpoint{0.173cm}{1.314cm}}{\pgfqpoint{0.207cm}{1.328cm}}{\pgfqpoint{0.233cm}{1.354cm}}
\pgfpathcurveto{\pgfqpoint{0.259cm}{1.379cm}}{\pgfqpoint{0.273cm}{1.414cm}}{\pgfqpoint{0.273cm}{1.451cm}}
\pgfusepath{fill}
\pgfpathmoveto{\pgfqpoint{1.345cm}{1.426cm}}
\pgfpathcurveto{\pgfqpoint{1.345cm}{1.463cm}}{\pgfqpoint{1.331cm}{1.497cm}}{\pgfqpoint{1.305cm}{1.523cm}}
\pgfpathcurveto{\pgfqpoint{1.28cm}{1.549cm}}{\pgfqpoint{1.245cm}{1.563cm}}{\pgfqpoint{1.209cm}{1.563cm}}
\pgfpathcurveto{\pgfqpoint{1.172cm}{1.563cm}}{\pgfqpoint{1.138cm}{1.549cm}}{\pgfqpoint{1.112cm}{1.523cm}}
\pgfpathcurveto{\pgfqpoint{1.087cm}{1.497cm}}{\pgfqpoint{1.072cm}{1.463cm}}{\pgfqpoint{1.072cm}{1.426cm}}
\pgfpathcurveto{\pgfqpoint{1.072cm}{1.39cm}}{\pgfqpoint{1.087cm}{1.355cm}}{\pgfqpoint{1.112cm}{1.329cm}}
\pgfpathcurveto{\pgfqpoint{1.138cm}{1.304cm}}{\pgfqpoint{1.172cm}{1.289cm}}{\pgfqpoint{1.209cm}{1.289cm}}
\pgfpathcurveto{\pgfqpoint{1.245cm}{1.289cm}}{\pgfqpoint{1.28cm}{1.304cm}}{\pgfqpoint{1.305cm}{1.329cm}}
\pgfpathcurveto{\pgfqpoint{1.331cm}{1.355cm}}{\pgfqpoint{1.345cm}{1.39cm}}{\pgfqpoint{1.345cm}{1.426cm}}
\pgfusepath{fill}
\begin{pgfscope}
\pgfsetdash{}{0cm}
\pgfsetlinewidth{0.818mm}
\pgfsetroundcap
\pgfsetmiterlimit{4.0}
\pgfpathmoveto{\pgfqpoint{0.682cm}{0.726cm}}
\pgfpathlineto{\pgfqpoint{0.682cm}{0.097cm}}
\pgfusepath{stroke}
\end{pgfscope}
\end{pgfscope}
\end{pgfscope}
\end{pgfscope}
\end{tikzpicture}}} = \llbracket X^2 \rrbracket, \quad X^{\!\resizebox{0.6em}{!}{
\begin{tikzpicture}
\pgfpathmoveto{\pgfqpoint{0cm}{0cm}}
\pgfpathlineto{\pgfqpoint{1.376cm}{0cm}}
\pgfpathlineto{\pgfqpoint{1.376cm}{1.588cm}}
\pgfpathlineto{\pgfqpoint{0cm}{1.588cm}}
\pgfpathclose
\pgfusepath{clip}
\begin{pgfscope}
\begin{pgfscope}
\pgfpathmoveto{\pgfqpoint{0cm}{0cm}}
\pgfpathlineto{\pgfqpoint{1.376cm}{0cm}}
\pgfpathlineto{\pgfqpoint{1.376cm}{1.588cm}}
\pgfpathlineto{\pgfqpoint{0cm}{1.588cm}}
\pgfpathclose
\pgfusepath{clip}
\begin{pgfscope}
\begin{pgfscope}
\definecolor{eps2pgf_color}{gray}{0.976471}\pgfsetstrokecolor{eps2pgf_color}\pgfsetfillcolor{eps2pgf_color}
\pgfpathmoveto{\pgfqpoint{0cm}{0cm}}
\pgfpathlineto{\pgfqpoint{1.376cm}{0cm}}
\pgfpathlineto{\pgfqpoint{1.376cm}{1.588cm}}
\pgfpathlineto{\pgfqpoint{0cm}{1.588cm}}
\pgfpathclose
\pgfusepath{fill}
\end{pgfscope}
\begin{pgfscope}
\pgfsetdash{}{0cm}
\pgfsetlinewidth{0.818mm}
\pgfsetroundcap
\pgfsetroundjoin
\pgfsetmiterlimit{7.0}
\definecolor{eps2pgf_color}{gray}{0}\pgfsetstrokecolor{eps2pgf_color}\pgfsetfillcolor{eps2pgf_color}
\pgfpathmoveto{\pgfqpoint{0.117cm}{1.476cm}}
\pgfpathlineto{\pgfqpoint{0.682cm}{0.726cm}}
\pgfpathlineto{\pgfqpoint{1.246cm}{1.476cm}}
\pgfusepath{stroke}
\end{pgfscope}
\definecolor{eps2pgf_color}{gray}{0}\pgfsetstrokecolor{eps2pgf_color}\pgfsetfillcolor{eps2pgf_color}
\pgfpathmoveto{\pgfqpoint{0.273cm}{1.451cm}}
\pgfpathcurveto{\pgfqpoint{0.273cm}{1.487cm}}{\pgfqpoint{0.259cm}{1.522cm}}{\pgfqpoint{0.233cm}{1.547cm}}
\pgfpathcurveto{\pgfqpoint{0.207cm}{1.573cm}}{\pgfqpoint{0.173cm}{1.588cm}}{\pgfqpoint{0.137cm}{1.588cm}}
\pgfpathcurveto{\pgfqpoint{0.1cm}{1.588cm}}{\pgfqpoint{0.066cm}{1.573cm}}{\pgfqpoint{0.04cm}{1.547cm}}
\pgfpathcurveto{\pgfqpoint{0.014cm}{1.522cm}}{\pgfqpoint{0cm}{1.487cm}}{\pgfqpoint{0cm}{1.451cm}}
\pgfpathcurveto{\pgfqpoint{0cm}{1.414cm}}{\pgfqpoint{0.014cm}{1.379cm}}{\pgfqpoint{0.04cm}{1.354cm}}
\pgfpathcurveto{\pgfqpoint{0.066cm}{1.328cm}}{\pgfqpoint{0.1cm}{1.314cm}}{\pgfqpoint{0.137cm}{1.314cm}}
\pgfpathcurveto{\pgfqpoint{0.173cm}{1.314cm}}{\pgfqpoint{0.207cm}{1.328cm}}{\pgfqpoint{0.233cm}{1.354cm}}
\pgfpathcurveto{\pgfqpoint{0.259cm}{1.379cm}}{\pgfqpoint{0.273cm}{1.414cm}}{\pgfqpoint{0.273cm}{1.451cm}}
\pgfusepath{fill}
\pgfpathmoveto{\pgfqpoint{1.345cm}{1.426cm}}
\pgfpathcurveto{\pgfqpoint{1.345cm}{1.463cm}}{\pgfqpoint{1.331cm}{1.497cm}}{\pgfqpoint{1.305cm}{1.523cm}}
\pgfpathcurveto{\pgfqpoint{1.28cm}{1.549cm}}{\pgfqpoint{1.245cm}{1.563cm}}{\pgfqpoint{1.209cm}{1.563cm}}
\pgfpathcurveto{\pgfqpoint{1.172cm}{1.563cm}}{\pgfqpoint{1.138cm}{1.549cm}}{\pgfqpoint{1.112cm}{1.523cm}}
\pgfpathcurveto{\pgfqpoint{1.087cm}{1.497cm}}{\pgfqpoint{1.072cm}{1.463cm}}{\pgfqpoint{1.072cm}{1.426cm}}
\pgfpathcurveto{\pgfqpoint{1.072cm}{1.39cm}}{\pgfqpoint{1.087cm}{1.355cm}}{\pgfqpoint{1.112cm}{1.329cm}}
\pgfpathcurveto{\pgfqpoint{1.138cm}{1.304cm}}{\pgfqpoint{1.172cm}{1.289cm}}{\pgfqpoint{1.209cm}{1.289cm}}
\pgfpathcurveto{\pgfqpoint{1.245cm}{1.289cm}}{\pgfqpoint{1.28cm}{1.304cm}}{\pgfqpoint{1.305cm}{1.329cm}}
\pgfpathcurveto{\pgfqpoint{1.331cm}{1.355cm}}{\pgfqpoint{1.345cm}{1.39cm}}{\pgfqpoint{1.345cm}{1.426cm}}
\pgfusepath{fill}
\begin{pgfscope}
\pgfsetdash{}{0cm}
\pgfsetlinewidth{0.818mm}
\pgfsetroundcap
\pgfsetmiterlimit{4.0}
\pgfpathmoveto{\pgfqpoint{0.682cm}{0.726cm}}
\pgfpathlineto{\pgfqpoint{0.682cm}{0.097cm}}
\pgfusepath{stroke}
\end{pgfscope}
\end{pgfscope}
\end{pgfscope}
\end{pgfscope}
\end{tikzpicture}}}(0)=0,\]
      \[ \LL X^{\!\resizebox{0.6em}{!}{
\begin{tikzpicture}
\pgfpathmoveto{\pgfqpoint{0cm}{-0.035cm}}
\pgfpathlineto{\pgfqpoint{1.376cm}{-0.035cm}}
\pgfpathlineto{\pgfqpoint{1.376cm}{1.552cm}}
\pgfpathlineto{\pgfqpoint{0cm}{1.552cm}}
\pgfpathclose
\pgfusepath{clip}
\begin{pgfscope}
\begin{pgfscope}
\pgfpathmoveto{\pgfqpoint{0cm}{-0.035cm}}
\pgfpathlineto{\pgfqpoint{1.376cm}{-0.035cm}}
\pgfpathlineto{\pgfqpoint{1.376cm}{1.552cm}}
\pgfpathlineto{\pgfqpoint{0cm}{1.552cm}}
\pgfpathclose
\pgfusepath{clip}
\begin{pgfscope}
\begin{pgfscope}
\pgfsetdash{}{0cm}
\pgfsetlinewidth{0.818mm}
\pgfsetroundcap
\pgfsetroundjoin
\pgfsetmiterlimit{7.0}
\definecolor{eps2pgf_color}{gray}{0}\pgfsetstrokecolor{eps2pgf_color}\pgfsetfillcolor{eps2pgf_color}
\pgfpathmoveto{\pgfqpoint{0.117cm}{1.421cm}}
\pgfpathlineto{\pgfqpoint{0.682cm}{0.671cm}}
\pgfpathlineto{\pgfqpoint{1.246cm}{1.421cm}}
\pgfusepath{stroke}
\end{pgfscope}
\definecolor{eps2pgf_color}{gray}{0}\pgfsetstrokecolor{eps2pgf_color}\pgfsetfillcolor{eps2pgf_color}
\pgfpathmoveto{\pgfqpoint{0.273cm}{1.395cm}}
\pgfpathcurveto{\pgfqpoint{0.273cm}{1.432cm}}{\pgfqpoint{0.259cm}{1.467cm}}{\pgfqpoint{0.233cm}{1.492cm}}
\pgfpathcurveto{\pgfqpoint{0.207cm}{1.518cm}}{\pgfqpoint{0.173cm}{1.532cm}}{\pgfqpoint{0.137cm}{1.532cm}}
\pgfpathcurveto{\pgfqpoint{0.1cm}{1.532cm}}{\pgfqpoint{0.066cm}{1.518cm}}{\pgfqpoint{0.04cm}{1.492cm}}
\pgfpathcurveto{\pgfqpoint{0.014cm}{1.467cm}}{\pgfqpoint{0cm}{1.432cm}}{\pgfqpoint{0cm}{1.395cm}}
\pgfpathcurveto{\pgfqpoint{0cm}{1.359cm}}{\pgfqpoint{0.014cm}{1.324cm}}{\pgfqpoint{0.04cm}{1.299cm}}
\pgfpathcurveto{\pgfqpoint{0.066cm}{1.273cm}}{\pgfqpoint{0.1cm}{1.258cm}}{\pgfqpoint{0.137cm}{1.258cm}}
\pgfpathcurveto{\pgfqpoint{0.173cm}{1.258cm}}{\pgfqpoint{0.207cm}{1.273cm}}{\pgfqpoint{0.233cm}{1.299cm}}
\pgfpathcurveto{\pgfqpoint{0.259cm}{1.324cm}}{\pgfqpoint{0.273cm}{1.359cm}}{\pgfqpoint{0.273cm}{1.395cm}}
\pgfusepath{fill}
\begin{pgfscope}
\pgfsetdash{}{0cm}
\pgfsetlinewidth{0.818mm}
\pgfsetmiterlimit{7.0}
\pgfpathmoveto{\pgfqpoint{0.682cm}{0.671cm}}
\pgfpathlineto{\pgfqpoint{0.679cm}{1.418cm}}
\pgfusepath{stroke}
\end{pgfscope}
\pgfpathmoveto{\pgfqpoint{0.815cm}{1.399cm}}
\pgfpathcurveto{\pgfqpoint{0.815cm}{1.435cm}}{\pgfqpoint{0.801cm}{1.47cm}}{\pgfqpoint{0.775cm}{1.496cm}}
\pgfpathcurveto{\pgfqpoint{0.75cm}{1.521cm}}{\pgfqpoint{0.715cm}{1.536cm}}{\pgfqpoint{0.679cm}{1.536cm}}
\pgfpathcurveto{\pgfqpoint{0.643cm}{1.536cm}}{\pgfqpoint{0.608cm}{1.521cm}}{\pgfqpoint{0.582cm}{1.496cm}}
\pgfpathcurveto{\pgfqpoint{0.557cm}{1.47cm}}{\pgfqpoint{0.542cm}{1.435cm}}{\pgfqpoint{0.542cm}{1.399cm}}
\pgfpathcurveto{\pgfqpoint{0.542cm}{1.363cm}}{\pgfqpoint{0.557cm}{1.328cm}}{\pgfqpoint{0.582cm}{1.302cm}}
\pgfpathcurveto{\pgfqpoint{0.608cm}{1.276cm}}{\pgfqpoint{0.643cm}{1.262cm}}{\pgfqpoint{0.679cm}{1.262cm}}
\pgfpathcurveto{\pgfqpoint{0.715cm}{1.262cm}}{\pgfqpoint{0.75cm}{1.276cm}}{\pgfqpoint{0.775cm}{1.302cm}}
\pgfpathcurveto{\pgfqpoint{0.801cm}{1.328cm}}{\pgfqpoint{0.815cm}{1.363cm}}{\pgfqpoint{0.815cm}{1.399cm}}
\pgfusepath{fill}
\pgfpathmoveto{\pgfqpoint{1.345cm}{1.371cm}}
\pgfpathcurveto{\pgfqpoint{1.345cm}{1.408cm}}{\pgfqpoint{1.331cm}{1.442cm}}{\pgfqpoint{1.305cm}{1.468cm}}
\pgfpathcurveto{\pgfqpoint{1.28cm}{1.494cm}}{\pgfqpoint{1.245cm}{1.508cm}}{\pgfqpoint{1.209cm}{1.508cm}}
\pgfpathcurveto{\pgfqpoint{1.172cm}{1.508cm}}{\pgfqpoint{1.138cm}{1.494cm}}{\pgfqpoint{1.112cm}{1.468cm}}
\pgfpathcurveto{\pgfqpoint{1.087cm}{1.442cm}}{\pgfqpoint{1.072cm}{1.408cm}}{\pgfqpoint{1.072cm}{1.371cm}}
\pgfpathcurveto{\pgfqpoint{1.072cm}{1.335cm}}{\pgfqpoint{1.087cm}{1.3cm}}{\pgfqpoint{1.112cm}{1.274cm}}
\pgfpathcurveto{\pgfqpoint{1.138cm}{1.249cm}}{\pgfqpoint{1.172cm}{1.234cm}}{\pgfqpoint{1.209cm}{1.234cm}}
\pgfpathcurveto{\pgfqpoint{1.245cm}{1.234cm}}{\pgfqpoint{1.28cm}{1.249cm}}{\pgfqpoint{1.305cm}{1.274cm}}
\pgfpathcurveto{\pgfqpoint{1.331cm}{1.3cm}}{\pgfqpoint{1.345cm}{1.335cm}}{\pgfqpoint{1.345cm}{1.371cm}}
\pgfusepath{fill}
\begin{pgfscope}
\pgfsetdash{}{0cm}
\pgfsetlinewidth{0.818mm}
\pgfsetroundcap
\pgfsetmiterlimit{4.0}
\pgfpathmoveto{\pgfqpoint{0.682cm}{0.671cm}}
\pgfpathlineto{\pgfqpoint{0.682cm}{0.042cm}}
\pgfusepath{stroke}
\end{pgfscope}
\end{pgfscope}
\end{pgfscope}
\end{pgfscope}
\end{tikzpicture}}}_{\varepsilon} = \llbracket X_{\varepsilon}^3 \rrbracket,\quad X^{\!\resizebox{0.6em}{!}{
\begin{tikzpicture}
\pgfpathmoveto{\pgfqpoint{0cm}{-0.035cm}}
\pgfpathlineto{\pgfqpoint{1.376cm}{-0.035cm}}
\pgfpathlineto{\pgfqpoint{1.376cm}{1.552cm}}
\pgfpathlineto{\pgfqpoint{0cm}{1.552cm}}
\pgfpathclose
\pgfusepath{clip}
\begin{pgfscope}
\begin{pgfscope}
\pgfpathmoveto{\pgfqpoint{0cm}{-0.035cm}}
\pgfpathlineto{\pgfqpoint{1.376cm}{-0.035cm}}
\pgfpathlineto{\pgfqpoint{1.376cm}{1.552cm}}
\pgfpathlineto{\pgfqpoint{0cm}{1.552cm}}
\pgfpathclose
\pgfusepath{clip}
\begin{pgfscope}
\begin{pgfscope}
\pgfsetdash{}{0cm}
\pgfsetlinewidth{0.818mm}
\pgfsetroundcap
\pgfsetroundjoin
\pgfsetmiterlimit{7.0}
\definecolor{eps2pgf_color}{gray}{0}\pgfsetstrokecolor{eps2pgf_color}\pgfsetfillcolor{eps2pgf_color}
\pgfpathmoveto{\pgfqpoint{0.117cm}{1.421cm}}
\pgfpathlineto{\pgfqpoint{0.682cm}{0.671cm}}
\pgfpathlineto{\pgfqpoint{1.246cm}{1.421cm}}
\pgfusepath{stroke}
\end{pgfscope}
\definecolor{eps2pgf_color}{gray}{0}\pgfsetstrokecolor{eps2pgf_color}\pgfsetfillcolor{eps2pgf_color}
\pgfpathmoveto{\pgfqpoint{0.273cm}{1.395cm}}
\pgfpathcurveto{\pgfqpoint{0.273cm}{1.432cm}}{\pgfqpoint{0.259cm}{1.467cm}}{\pgfqpoint{0.233cm}{1.492cm}}
\pgfpathcurveto{\pgfqpoint{0.207cm}{1.518cm}}{\pgfqpoint{0.173cm}{1.532cm}}{\pgfqpoint{0.137cm}{1.532cm}}
\pgfpathcurveto{\pgfqpoint{0.1cm}{1.532cm}}{\pgfqpoint{0.066cm}{1.518cm}}{\pgfqpoint{0.04cm}{1.492cm}}
\pgfpathcurveto{\pgfqpoint{0.014cm}{1.467cm}}{\pgfqpoint{0cm}{1.432cm}}{\pgfqpoint{0cm}{1.395cm}}
\pgfpathcurveto{\pgfqpoint{0cm}{1.359cm}}{\pgfqpoint{0.014cm}{1.324cm}}{\pgfqpoint{0.04cm}{1.299cm}}
\pgfpathcurveto{\pgfqpoint{0.066cm}{1.273cm}}{\pgfqpoint{0.1cm}{1.258cm}}{\pgfqpoint{0.137cm}{1.258cm}}
\pgfpathcurveto{\pgfqpoint{0.173cm}{1.258cm}}{\pgfqpoint{0.207cm}{1.273cm}}{\pgfqpoint{0.233cm}{1.299cm}}
\pgfpathcurveto{\pgfqpoint{0.259cm}{1.324cm}}{\pgfqpoint{0.273cm}{1.359cm}}{\pgfqpoint{0.273cm}{1.395cm}}
\pgfusepath{fill}
\begin{pgfscope}
\pgfsetdash{}{0cm}
\pgfsetlinewidth{0.818mm}
\pgfsetmiterlimit{7.0}
\pgfpathmoveto{\pgfqpoint{0.682cm}{0.671cm}}
\pgfpathlineto{\pgfqpoint{0.679cm}{1.418cm}}
\pgfusepath{stroke}
\end{pgfscope}
\pgfpathmoveto{\pgfqpoint{0.815cm}{1.399cm}}
\pgfpathcurveto{\pgfqpoint{0.815cm}{1.435cm}}{\pgfqpoint{0.801cm}{1.47cm}}{\pgfqpoint{0.775cm}{1.496cm}}
\pgfpathcurveto{\pgfqpoint{0.75cm}{1.521cm}}{\pgfqpoint{0.715cm}{1.536cm}}{\pgfqpoint{0.679cm}{1.536cm}}
\pgfpathcurveto{\pgfqpoint{0.643cm}{1.536cm}}{\pgfqpoint{0.608cm}{1.521cm}}{\pgfqpoint{0.582cm}{1.496cm}}
\pgfpathcurveto{\pgfqpoint{0.557cm}{1.47cm}}{\pgfqpoint{0.542cm}{1.435cm}}{\pgfqpoint{0.542cm}{1.399cm}}
\pgfpathcurveto{\pgfqpoint{0.542cm}{1.363cm}}{\pgfqpoint{0.557cm}{1.328cm}}{\pgfqpoint{0.582cm}{1.302cm}}
\pgfpathcurveto{\pgfqpoint{0.608cm}{1.276cm}}{\pgfqpoint{0.643cm}{1.262cm}}{\pgfqpoint{0.679cm}{1.262cm}}
\pgfpathcurveto{\pgfqpoint{0.715cm}{1.262cm}}{\pgfqpoint{0.75cm}{1.276cm}}{\pgfqpoint{0.775cm}{1.302cm}}
\pgfpathcurveto{\pgfqpoint{0.801cm}{1.328cm}}{\pgfqpoint{0.815cm}{1.363cm}}{\pgfqpoint{0.815cm}{1.399cm}}
\pgfusepath{fill}
\pgfpathmoveto{\pgfqpoint{1.345cm}{1.371cm}}
\pgfpathcurveto{\pgfqpoint{1.345cm}{1.408cm}}{\pgfqpoint{1.331cm}{1.442cm}}{\pgfqpoint{1.305cm}{1.468cm}}
\pgfpathcurveto{\pgfqpoint{1.28cm}{1.494cm}}{\pgfqpoint{1.245cm}{1.508cm}}{\pgfqpoint{1.209cm}{1.508cm}}
\pgfpathcurveto{\pgfqpoint{1.172cm}{1.508cm}}{\pgfqpoint{1.138cm}{1.494cm}}{\pgfqpoint{1.112cm}{1.468cm}}
\pgfpathcurveto{\pgfqpoint{1.087cm}{1.442cm}}{\pgfqpoint{1.072cm}{1.408cm}}{\pgfqpoint{1.072cm}{1.371cm}}
\pgfpathcurveto{\pgfqpoint{1.072cm}{1.335cm}}{\pgfqpoint{1.087cm}{1.3cm}}{\pgfqpoint{1.112cm}{1.274cm}}
\pgfpathcurveto{\pgfqpoint{1.138cm}{1.249cm}}{\pgfqpoint{1.172cm}{1.234cm}}{\pgfqpoint{1.209cm}{1.234cm}}
\pgfpathcurveto{\pgfqpoint{1.245cm}{1.234cm}}{\pgfqpoint{1.28cm}{1.249cm}}{\pgfqpoint{1.305cm}{1.274cm}}
\pgfpathcurveto{\pgfqpoint{1.331cm}{1.3cm}}{\pgfqpoint{1.345cm}{1.335cm}}{\pgfqpoint{1.345cm}{1.371cm}}
\pgfusepath{fill}
\begin{pgfscope}
\pgfsetdash{}{0cm}
\pgfsetlinewidth{0.818mm}
\pgfsetroundcap
\pgfsetmiterlimit{4.0}
\pgfpathmoveto{\pgfqpoint{0.682cm}{0.671cm}}
\pgfpathlineto{\pgfqpoint{0.682cm}{0.042cm}}
\pgfusepath{stroke}
\end{pgfscope}
\end{pgfscope}
\end{pgfscope}
\end{pgfscope}
\end{tikzpicture}}}_{\varepsilon}(0)=0, \qquad \LL
   X^{\!\resizebox{0.6em}{!}{
\begin{tikzpicture}
\pgfpathmoveto{\pgfqpoint{0cm}{0cm}}
\pgfpathlineto{\pgfqpoint{1.376cm}{0cm}}
\pgfpathlineto{\pgfqpoint{1.376cm}{1.588cm}}
\pgfpathlineto{\pgfqpoint{0cm}{1.588cm}}
\pgfpathclose
\pgfusepath{clip}
\begin{pgfscope}
\begin{pgfscope}
\pgfpathmoveto{\pgfqpoint{0cm}{0cm}}
\pgfpathlineto{\pgfqpoint{1.376cm}{0cm}}
\pgfpathlineto{\pgfqpoint{1.376cm}{1.588cm}}
\pgfpathlineto{\pgfqpoint{0cm}{1.588cm}}
\pgfpathclose
\pgfusepath{clip}
\begin{pgfscope}
\begin{pgfscope}
\definecolor{eps2pgf_color}{gray}{0.976471}\pgfsetstrokecolor{eps2pgf_color}\pgfsetfillcolor{eps2pgf_color}
\pgfpathmoveto{\pgfqpoint{0cm}{0cm}}
\pgfpathlineto{\pgfqpoint{1.376cm}{0cm}}
\pgfpathlineto{\pgfqpoint{1.376cm}{1.588cm}}
\pgfpathlineto{\pgfqpoint{0cm}{1.588cm}}
\pgfpathclose
\pgfusepath{fill}
\end{pgfscope}
\begin{pgfscope}
\pgfsetdash{}{0cm}
\pgfsetlinewidth{0.818mm}
\pgfsetroundcap
\pgfsetroundjoin
\pgfsetmiterlimit{7.0}
\definecolor{eps2pgf_color}{gray}{0}\pgfsetstrokecolor{eps2pgf_color}\pgfsetfillcolor{eps2pgf_color}
\pgfpathmoveto{\pgfqpoint{0.117cm}{1.476cm}}
\pgfpathlineto{\pgfqpoint{0.682cm}{0.726cm}}
\pgfpathlineto{\pgfqpoint{1.246cm}{1.476cm}}
\pgfusepath{stroke}
\end{pgfscope}
\definecolor{eps2pgf_color}{gray}{0}\pgfsetstrokecolor{eps2pgf_color}\pgfsetfillcolor{eps2pgf_color}
\pgfpathmoveto{\pgfqpoint{0.273cm}{1.451cm}}
\pgfpathcurveto{\pgfqpoint{0.273cm}{1.487cm}}{\pgfqpoint{0.259cm}{1.522cm}}{\pgfqpoint{0.233cm}{1.547cm}}
\pgfpathcurveto{\pgfqpoint{0.207cm}{1.573cm}}{\pgfqpoint{0.173cm}{1.588cm}}{\pgfqpoint{0.137cm}{1.588cm}}
\pgfpathcurveto{\pgfqpoint{0.1cm}{1.588cm}}{\pgfqpoint{0.066cm}{1.573cm}}{\pgfqpoint{0.04cm}{1.547cm}}
\pgfpathcurveto{\pgfqpoint{0.014cm}{1.522cm}}{\pgfqpoint{0cm}{1.487cm}}{\pgfqpoint{0cm}{1.451cm}}
\pgfpathcurveto{\pgfqpoint{0cm}{1.414cm}}{\pgfqpoint{0.014cm}{1.379cm}}{\pgfqpoint{0.04cm}{1.354cm}}
\pgfpathcurveto{\pgfqpoint{0.066cm}{1.328cm}}{\pgfqpoint{0.1cm}{1.314cm}}{\pgfqpoint{0.137cm}{1.314cm}}
\pgfpathcurveto{\pgfqpoint{0.173cm}{1.314cm}}{\pgfqpoint{0.207cm}{1.328cm}}{\pgfqpoint{0.233cm}{1.354cm}}
\pgfpathcurveto{\pgfqpoint{0.259cm}{1.379cm}}{\pgfqpoint{0.273cm}{1.414cm}}{\pgfqpoint{0.273cm}{1.451cm}}
\pgfusepath{fill}
\pgfpathmoveto{\pgfqpoint{1.345cm}{1.426cm}}
\pgfpathcurveto{\pgfqpoint{1.345cm}{1.463cm}}{\pgfqpoint{1.331cm}{1.497cm}}{\pgfqpoint{1.305cm}{1.523cm}}
\pgfpathcurveto{\pgfqpoint{1.28cm}{1.549cm}}{\pgfqpoint{1.245cm}{1.563cm}}{\pgfqpoint{1.209cm}{1.563cm}}
\pgfpathcurveto{\pgfqpoint{1.172cm}{1.563cm}}{\pgfqpoint{1.138cm}{1.549cm}}{\pgfqpoint{1.112cm}{1.523cm}}
\pgfpathcurveto{\pgfqpoint{1.087cm}{1.497cm}}{\pgfqpoint{1.072cm}{1.463cm}}{\pgfqpoint{1.072cm}{1.426cm}}
\pgfpathcurveto{\pgfqpoint{1.072cm}{1.39cm}}{\pgfqpoint{1.087cm}{1.355cm}}{\pgfqpoint{1.112cm}{1.329cm}}
\pgfpathcurveto{\pgfqpoint{1.138cm}{1.304cm}}{\pgfqpoint{1.172cm}{1.289cm}}{\pgfqpoint{1.209cm}{1.289cm}}
\pgfpathcurveto{\pgfqpoint{1.245cm}{1.289cm}}{\pgfqpoint{1.28cm}{1.304cm}}{\pgfqpoint{1.305cm}{1.329cm}}
\pgfpathcurveto{\pgfqpoint{1.331cm}{1.355cm}}{\pgfqpoint{1.345cm}{1.39cm}}{\pgfqpoint{1.345cm}{1.426cm}}
\pgfusepath{fill}
\begin{pgfscope}
\pgfsetdash{}{0cm}
\pgfsetlinewidth{0.818mm}
\pgfsetroundcap
\pgfsetmiterlimit{4.0}
\pgfpathmoveto{\pgfqpoint{0.682cm}{0.726cm}}
\pgfpathlineto{\pgfqpoint{0.682cm}{0.097cm}}
\pgfusepath{stroke}
\end{pgfscope}
\end{pgfscope}
\end{pgfscope}
\end{pgfscope}
\end{tikzpicture}}}_{\varepsilon} = \llbracket X_{\varepsilon}^2 \rrbracket,\quad X^{\!\resizebox{0.6em}{!}{
\begin{tikzpicture}
\pgfpathmoveto{\pgfqpoint{0cm}{0cm}}
\pgfpathlineto{\pgfqpoint{1.376cm}{0cm}}
\pgfpathlineto{\pgfqpoint{1.376cm}{1.588cm}}
\pgfpathlineto{\pgfqpoint{0cm}{1.588cm}}
\pgfpathclose
\pgfusepath{clip}
\begin{pgfscope}
\begin{pgfscope}
\pgfpathmoveto{\pgfqpoint{0cm}{0cm}}
\pgfpathlineto{\pgfqpoint{1.376cm}{0cm}}
\pgfpathlineto{\pgfqpoint{1.376cm}{1.588cm}}
\pgfpathlineto{\pgfqpoint{0cm}{1.588cm}}
\pgfpathclose
\pgfusepath{clip}
\begin{pgfscope}
\begin{pgfscope}
\definecolor{eps2pgf_color}{gray}{0.976471}\pgfsetstrokecolor{eps2pgf_color}\pgfsetfillcolor{eps2pgf_color}
\pgfpathmoveto{\pgfqpoint{0cm}{0cm}}
\pgfpathlineto{\pgfqpoint{1.376cm}{0cm}}
\pgfpathlineto{\pgfqpoint{1.376cm}{1.588cm}}
\pgfpathlineto{\pgfqpoint{0cm}{1.588cm}}
\pgfpathclose
\pgfusepath{fill}
\end{pgfscope}
\begin{pgfscope}
\pgfsetdash{}{0cm}
\pgfsetlinewidth{0.818mm}
\pgfsetroundcap
\pgfsetroundjoin
\pgfsetmiterlimit{7.0}
\definecolor{eps2pgf_color}{gray}{0}\pgfsetstrokecolor{eps2pgf_color}\pgfsetfillcolor{eps2pgf_color}
\pgfpathmoveto{\pgfqpoint{0.117cm}{1.476cm}}
\pgfpathlineto{\pgfqpoint{0.682cm}{0.726cm}}
\pgfpathlineto{\pgfqpoint{1.246cm}{1.476cm}}
\pgfusepath{stroke}
\end{pgfscope}
\definecolor{eps2pgf_color}{gray}{0}\pgfsetstrokecolor{eps2pgf_color}\pgfsetfillcolor{eps2pgf_color}
\pgfpathmoveto{\pgfqpoint{0.273cm}{1.451cm}}
\pgfpathcurveto{\pgfqpoint{0.273cm}{1.487cm}}{\pgfqpoint{0.259cm}{1.522cm}}{\pgfqpoint{0.233cm}{1.547cm}}
\pgfpathcurveto{\pgfqpoint{0.207cm}{1.573cm}}{\pgfqpoint{0.173cm}{1.588cm}}{\pgfqpoint{0.137cm}{1.588cm}}
\pgfpathcurveto{\pgfqpoint{0.1cm}{1.588cm}}{\pgfqpoint{0.066cm}{1.573cm}}{\pgfqpoint{0.04cm}{1.547cm}}
\pgfpathcurveto{\pgfqpoint{0.014cm}{1.522cm}}{\pgfqpoint{0cm}{1.487cm}}{\pgfqpoint{0cm}{1.451cm}}
\pgfpathcurveto{\pgfqpoint{0cm}{1.414cm}}{\pgfqpoint{0.014cm}{1.379cm}}{\pgfqpoint{0.04cm}{1.354cm}}
\pgfpathcurveto{\pgfqpoint{0.066cm}{1.328cm}}{\pgfqpoint{0.1cm}{1.314cm}}{\pgfqpoint{0.137cm}{1.314cm}}
\pgfpathcurveto{\pgfqpoint{0.173cm}{1.314cm}}{\pgfqpoint{0.207cm}{1.328cm}}{\pgfqpoint{0.233cm}{1.354cm}}
\pgfpathcurveto{\pgfqpoint{0.259cm}{1.379cm}}{\pgfqpoint{0.273cm}{1.414cm}}{\pgfqpoint{0.273cm}{1.451cm}}
\pgfusepath{fill}
\pgfpathmoveto{\pgfqpoint{1.345cm}{1.426cm}}
\pgfpathcurveto{\pgfqpoint{1.345cm}{1.463cm}}{\pgfqpoint{1.331cm}{1.497cm}}{\pgfqpoint{1.305cm}{1.523cm}}
\pgfpathcurveto{\pgfqpoint{1.28cm}{1.549cm}}{\pgfqpoint{1.245cm}{1.563cm}}{\pgfqpoint{1.209cm}{1.563cm}}
\pgfpathcurveto{\pgfqpoint{1.172cm}{1.563cm}}{\pgfqpoint{1.138cm}{1.549cm}}{\pgfqpoint{1.112cm}{1.523cm}}
\pgfpathcurveto{\pgfqpoint{1.087cm}{1.497cm}}{\pgfqpoint{1.072cm}{1.463cm}}{\pgfqpoint{1.072cm}{1.426cm}}
\pgfpathcurveto{\pgfqpoint{1.072cm}{1.39cm}}{\pgfqpoint{1.087cm}{1.355cm}}{\pgfqpoint{1.112cm}{1.329cm}}
\pgfpathcurveto{\pgfqpoint{1.138cm}{1.304cm}}{\pgfqpoint{1.172cm}{1.289cm}}{\pgfqpoint{1.209cm}{1.289cm}}
\pgfpathcurveto{\pgfqpoint{1.245cm}{1.289cm}}{\pgfqpoint{1.28cm}{1.304cm}}{\pgfqpoint{1.305cm}{1.329cm}}
\pgfpathcurveto{\pgfqpoint{1.331cm}{1.355cm}}{\pgfqpoint{1.345cm}{1.39cm}}{\pgfqpoint{1.345cm}{1.426cm}}
\pgfusepath{fill}
\begin{pgfscope}
\pgfsetdash{}{0cm}
\pgfsetlinewidth{0.818mm}
\pgfsetroundcap
\pgfsetmiterlimit{4.0}
\pgfpathmoveto{\pgfqpoint{0.682cm}{0.726cm}}
\pgfpathlineto{\pgfqpoint{0.682cm}{0.097cm}}
\pgfusepath{stroke}
\end{pgfscope}
\end{pgfscope}
\end{pgfscope}
\end{pgfscope}
\end{tikzpicture}}}_{\varepsilon}(0)=0, \]
\[X^{\!\resizebox{!}{.8em}{
\begin{tikzpicture}
\pgfpathmoveto{\pgfqpoint{0cm}{-0.035cm}}
\pgfpathlineto{\pgfqpoint{1.976cm}{-0.035cm}}
\pgfpathlineto{\pgfqpoint{1.976cm}{1.94cm}}
\pgfpathlineto{\pgfqpoint{0cm}{1.94cm}}
\pgfpathclose
\pgfusepath{clip}
\begin{pgfscope}
\begin{pgfscope}
\pgfpathmoveto{\pgfqpoint{0cm}{-0.035cm}}
\pgfpathlineto{\pgfqpoint{1.976cm}{-0.035cm}}
\pgfpathlineto{\pgfqpoint{1.976cm}{1.94cm}}
\pgfpathlineto{\pgfqpoint{0cm}{1.94cm}}
\pgfpathclose
\pgfusepath{clip}
\begin{pgfscope}
\begin{pgfscope}
\pgfsetdash{}{0cm}
\pgfsetlinewidth{0.818mm}
\pgfsetroundcap
\pgfsetroundjoin
\pgfsetmiterlimit{7.0}
\definecolor{eps2pgf_color}{gray}{0}\pgfsetstrokecolor{eps2pgf_color}\pgfsetfillcolor{eps2pgf_color}
\pgfpathmoveto{\pgfqpoint{0.117cm}{1.815cm}}
\pgfpathlineto{\pgfqpoint{0.682cm}{1.065cm}}
\pgfpathlineto{\pgfqpoint{1.246cm}{1.815cm}}
\pgfusepath{stroke}
\end{pgfscope}
\definecolor{eps2pgf_color}{gray}{0}\pgfsetstrokecolor{eps2pgf_color}\pgfsetfillcolor{eps2pgf_color}
\pgfpathmoveto{\pgfqpoint{0.273cm}{1.789cm}}
\pgfpathcurveto{\pgfqpoint{0.273cm}{1.825cm}}{\pgfqpoint{0.259cm}{1.86cm}}{\pgfqpoint{0.233cm}{1.886cm}}
\pgfpathcurveto{\pgfqpoint{0.207cm}{1.912cm}}{\pgfqpoint{0.173cm}{1.926cm}}{\pgfqpoint{0.137cm}{1.926cm}}
\pgfpathcurveto{\pgfqpoint{0.1cm}{1.926cm}}{\pgfqpoint{0.066cm}{1.912cm}}{\pgfqpoint{0.04cm}{1.886cm}}
\pgfpathcurveto{\pgfqpoint{0.014cm}{1.86cm}}{\pgfqpoint{0cm}{1.825cm}}{\pgfqpoint{0cm}{1.789cm}}
\pgfpathcurveto{\pgfqpoint{0cm}{1.753cm}}{\pgfqpoint{0.014cm}{1.718cm}}{\pgfqpoint{0.04cm}{1.692cm}}
\pgfpathcurveto{\pgfqpoint{0.066cm}{1.667cm}}{\pgfqpoint{0.1cm}{1.652cm}}{\pgfqpoint{0.137cm}{1.652cm}}
\pgfpathcurveto{\pgfqpoint{0.173cm}{1.652cm}}{\pgfqpoint{0.207cm}{1.667cm}}{\pgfqpoint{0.233cm}{1.692cm}}
\pgfpathcurveto{\pgfqpoint{0.259cm}{1.718cm}}{\pgfqpoint{0.273cm}{1.753cm}}{\pgfqpoint{0.273cm}{1.789cm}}
\pgfusepath{fill}
\begin{pgfscope}
\pgfsetdash{}{0cm}
\pgfsetlinewidth{0.818mm}
\pgfsetmiterlimit{7.0}
\pgfpathmoveto{\pgfqpoint{0.682cm}{1.065cm}}
\pgfpathlineto{\pgfqpoint{0.679cm}{1.812cm}}
\pgfusepath{stroke}
\end{pgfscope}
\pgfpathmoveto{\pgfqpoint{0.815cm}{1.793cm}}
\pgfpathcurveto{\pgfqpoint{0.815cm}{1.829cm}}{\pgfqpoint{0.801cm}{1.864cm}}{\pgfqpoint{0.775cm}{1.89cm}}
\pgfpathcurveto{\pgfqpoint{0.75cm}{1.915cm}}{\pgfqpoint{0.715cm}{1.93cm}}{\pgfqpoint{0.679cm}{1.93cm}}
\pgfpathcurveto{\pgfqpoint{0.643cm}{1.93cm}}{\pgfqpoint{0.608cm}{1.915cm}}{\pgfqpoint{0.582cm}{1.89cm}}
\pgfpathcurveto{\pgfqpoint{0.557cm}{1.864cm}}{\pgfqpoint{0.542cm}{1.829cm}}{\pgfqpoint{0.542cm}{1.793cm}}
\pgfpathcurveto{\pgfqpoint{0.542cm}{1.756cm}}{\pgfqpoint{0.557cm}{1.722cm}}{\pgfqpoint{0.582cm}{1.696cm}}
\pgfpathcurveto{\pgfqpoint{0.608cm}{1.67cm}}{\pgfqpoint{0.643cm}{1.656cm}}{\pgfqpoint{0.679cm}{1.656cm}}
\pgfpathcurveto{\pgfqpoint{0.715cm}{1.656cm}}{\pgfqpoint{0.75cm}{1.67cm}}{\pgfqpoint{0.775cm}{1.696cm}}
\pgfpathcurveto{\pgfqpoint{0.801cm}{1.722cm}}{\pgfqpoint{0.815cm}{1.756cm}}{\pgfqpoint{0.815cm}{1.793cm}}
\pgfusepath{fill}
\pgfpathmoveto{\pgfqpoint{1.345cm}{1.765cm}}
\pgfpathcurveto{\pgfqpoint{1.345cm}{1.801cm}}{\pgfqpoint{1.331cm}{1.836cm}}{\pgfqpoint{1.305cm}{1.862cm}}
\pgfpathcurveto{\pgfqpoint{1.28cm}{1.887cm}}{\pgfqpoint{1.245cm}{1.902cm}}{\pgfqpoint{1.209cm}{1.902cm}}
\pgfpathcurveto{\pgfqpoint{1.172cm}{1.902cm}}{\pgfqpoint{1.138cm}{1.887cm}}{\pgfqpoint{1.112cm}{1.862cm}}
\pgfpathcurveto{\pgfqpoint{1.087cm}{1.836cm}}{\pgfqpoint{1.072cm}{1.801cm}}{\pgfqpoint{1.072cm}{1.765cm}}
\pgfpathcurveto{\pgfqpoint{1.072cm}{1.728cm}}{\pgfqpoint{1.087cm}{1.694cm}}{\pgfqpoint{1.112cm}{1.668cm}}
\pgfpathcurveto{\pgfqpoint{1.138cm}{1.642cm}}{\pgfqpoint{1.172cm}{1.628cm}}{\pgfqpoint{1.209cm}{1.628cm}}
\pgfpathcurveto{\pgfqpoint{1.245cm}{1.628cm}}{\pgfqpoint{1.28cm}{1.642cm}}{\pgfqpoint{1.305cm}{1.668cm}}
\pgfpathcurveto{\pgfqpoint{1.331cm}{1.694cm}}{\pgfqpoint{1.345cm}{1.728cm}}{\pgfqpoint{1.345cm}{1.765cm}}
\pgfusepath{fill}
\begin{pgfscope}
\pgfsetdash{}{0cm}
\pgfsetlinewidth{0.818mm}
\pgfsetroundcap
\pgfsetroundjoin
\pgfsetmiterlimit{7.0}
\pgfpathmoveto{\pgfqpoint{0.682cm}{1.065cm}}
\pgfpathlineto{\pgfqpoint{1.246cm}{0.315cm}}
\pgfpathlineto{\pgfqpoint{1.811cm}{1.065cm}}
\pgfusepath{stroke}
\end{pgfscope}
\pgfpathmoveto{\pgfqpoint{1.948cm}{1.065cm}}
\pgfpathcurveto{\pgfqpoint{1.948cm}{1.101cm}}{\pgfqpoint{1.933cm}{1.136cm}}{\pgfqpoint{1.907cm}{1.162cm}}
\pgfpathcurveto{\pgfqpoint{1.882cm}{1.187cm}}{\pgfqpoint{1.847cm}{1.202cm}}{\pgfqpoint{1.811cm}{1.202cm}}
\pgfpathcurveto{\pgfqpoint{1.775cm}{1.202cm}}{\pgfqpoint{1.74cm}{1.187cm}}{\pgfqpoint{1.714cm}{1.162cm}}
\pgfpathcurveto{\pgfqpoint{1.689cm}{1.136cm}}{\pgfqpoint{1.674cm}{1.101cm}}{\pgfqpoint{1.674cm}{1.065cm}}
\pgfpathcurveto{\pgfqpoint{1.674cm}{1.029cm}}{\pgfqpoint{1.689cm}{0.994cm}}{\pgfqpoint{1.714cm}{0.968cm}}
\pgfpathcurveto{\pgfqpoint{1.74cm}{0.942cm}}{\pgfqpoint{1.775cm}{0.928cm}}{\pgfqpoint{1.811cm}{0.928cm}}
\pgfpathcurveto{\pgfqpoint{1.847cm}{0.928cm}}{\pgfqpoint{1.882cm}{0.942cm}}{\pgfqpoint{1.907cm}{0.968cm}}
\pgfpathcurveto{\pgfqpoint{1.933cm}{0.994cm}}{\pgfqpoint{1.948cm}{1.029cm}}{\pgfqpoint{1.948cm}{1.065cm}}
\pgfusepath{fill}
\begin{pgfscope}
\pgfsetdash{}{0cm}
\pgfsetlinewidth{0.818mm}
\pgfsetmiterlimit{4.0}
\pgfpathmoveto{\pgfqpoint{1.383cm}{0.178cm}}
\pgfpathcurveto{\pgfqpoint{1.383cm}{0.214cm}}{\pgfqpoint{1.369cm}{0.249cm}}{\pgfqpoint{1.343cm}{0.275cm}}
\pgfpathcurveto{\pgfqpoint{1.317cm}{0.3cm}}{\pgfqpoint{1.283cm}{0.315cm}}{\pgfqpoint{1.246cm}{0.315cm}}
\pgfpathcurveto{\pgfqpoint{1.21cm}{0.315cm}}{\pgfqpoint{1.175cm}{0.3cm}}{\pgfqpoint{1.15cm}{0.275cm}}
\pgfpathcurveto{\pgfqpoint{1.124cm}{0.249cm}}{\pgfqpoint{1.11cm}{0.214cm}}{\pgfqpoint{1.11cm}{0.178cm}}
\pgfpathcurveto{\pgfqpoint{1.11cm}{0.141cm}}{\pgfqpoint{1.124cm}{0.107cm}}{\pgfqpoint{1.15cm}{0.081cm}}
\pgfpathcurveto{\pgfqpoint{1.175cm}{0.055cm}}{\pgfqpoint{1.21cm}{0.041cm}}{\pgfqpoint{1.246cm}{0.041cm}}
\pgfpathcurveto{\pgfqpoint{1.283cm}{0.041cm}}{\pgfqpoint{1.317cm}{0.055cm}}{\pgfqpoint{1.343cm}{0.081cm}}
\pgfpathcurveto{\pgfqpoint{1.369cm}{0.107cm}}{\pgfqpoint{1.383cm}{0.141cm}}{\pgfqpoint{1.383cm}{0.178cm}}
\pgfusepath{stroke}
\end{pgfscope}
\end{pgfscope}
\end{pgfscope}
\end{pgfscope}
\end{tikzpicture}}}=\lim_{\varepsilon\to 0}X^{\!\resizebox{0.6em}{!}{
\begin{tikzpicture}
\pgfpathmoveto{\pgfqpoint{0cm}{-0.035cm}}
\pgfpathlineto{\pgfqpoint{1.376cm}{-0.035cm}}
\pgfpathlineto{\pgfqpoint{1.376cm}{1.552cm}}
\pgfpathlineto{\pgfqpoint{0cm}{1.552cm}}
\pgfpathclose
\pgfusepath{clip}
\begin{pgfscope}
\begin{pgfscope}
\pgfpathmoveto{\pgfqpoint{0cm}{-0.035cm}}
\pgfpathlineto{\pgfqpoint{1.376cm}{-0.035cm}}
\pgfpathlineto{\pgfqpoint{1.376cm}{1.552cm}}
\pgfpathlineto{\pgfqpoint{0cm}{1.552cm}}
\pgfpathclose
\pgfusepath{clip}
\begin{pgfscope}
\begin{pgfscope}
\pgfsetdash{}{0cm}
\pgfsetlinewidth{0.818mm}
\pgfsetroundcap
\pgfsetroundjoin
\pgfsetmiterlimit{7.0}
\definecolor{eps2pgf_color}{gray}{0}\pgfsetstrokecolor{eps2pgf_color}\pgfsetfillcolor{eps2pgf_color}
\pgfpathmoveto{\pgfqpoint{0.117cm}{1.421cm}}
\pgfpathlineto{\pgfqpoint{0.682cm}{0.671cm}}
\pgfpathlineto{\pgfqpoint{1.246cm}{1.421cm}}
\pgfusepath{stroke}
\end{pgfscope}
\definecolor{eps2pgf_color}{gray}{0}\pgfsetstrokecolor{eps2pgf_color}\pgfsetfillcolor{eps2pgf_color}
\pgfpathmoveto{\pgfqpoint{0.273cm}{1.395cm}}
\pgfpathcurveto{\pgfqpoint{0.273cm}{1.432cm}}{\pgfqpoint{0.259cm}{1.467cm}}{\pgfqpoint{0.233cm}{1.492cm}}
\pgfpathcurveto{\pgfqpoint{0.207cm}{1.518cm}}{\pgfqpoint{0.173cm}{1.532cm}}{\pgfqpoint{0.137cm}{1.532cm}}
\pgfpathcurveto{\pgfqpoint{0.1cm}{1.532cm}}{\pgfqpoint{0.066cm}{1.518cm}}{\pgfqpoint{0.04cm}{1.492cm}}
\pgfpathcurveto{\pgfqpoint{0.014cm}{1.467cm}}{\pgfqpoint{0cm}{1.432cm}}{\pgfqpoint{0cm}{1.395cm}}
\pgfpathcurveto{\pgfqpoint{0cm}{1.359cm}}{\pgfqpoint{0.014cm}{1.324cm}}{\pgfqpoint{0.04cm}{1.299cm}}
\pgfpathcurveto{\pgfqpoint{0.066cm}{1.273cm}}{\pgfqpoint{0.1cm}{1.258cm}}{\pgfqpoint{0.137cm}{1.258cm}}
\pgfpathcurveto{\pgfqpoint{0.173cm}{1.258cm}}{\pgfqpoint{0.207cm}{1.273cm}}{\pgfqpoint{0.233cm}{1.299cm}}
\pgfpathcurveto{\pgfqpoint{0.259cm}{1.324cm}}{\pgfqpoint{0.273cm}{1.359cm}}{\pgfqpoint{0.273cm}{1.395cm}}
\pgfusepath{fill}
\begin{pgfscope}
\pgfsetdash{}{0cm}
\pgfsetlinewidth{0.818mm}
\pgfsetmiterlimit{7.0}
\pgfpathmoveto{\pgfqpoint{0.682cm}{0.671cm}}
\pgfpathlineto{\pgfqpoint{0.679cm}{1.418cm}}
\pgfusepath{stroke}
\end{pgfscope}
\pgfpathmoveto{\pgfqpoint{0.815cm}{1.399cm}}
\pgfpathcurveto{\pgfqpoint{0.815cm}{1.435cm}}{\pgfqpoint{0.801cm}{1.47cm}}{\pgfqpoint{0.775cm}{1.496cm}}
\pgfpathcurveto{\pgfqpoint{0.75cm}{1.521cm}}{\pgfqpoint{0.715cm}{1.536cm}}{\pgfqpoint{0.679cm}{1.536cm}}
\pgfpathcurveto{\pgfqpoint{0.643cm}{1.536cm}}{\pgfqpoint{0.608cm}{1.521cm}}{\pgfqpoint{0.582cm}{1.496cm}}
\pgfpathcurveto{\pgfqpoint{0.557cm}{1.47cm}}{\pgfqpoint{0.542cm}{1.435cm}}{\pgfqpoint{0.542cm}{1.399cm}}
\pgfpathcurveto{\pgfqpoint{0.542cm}{1.363cm}}{\pgfqpoint{0.557cm}{1.328cm}}{\pgfqpoint{0.582cm}{1.302cm}}
\pgfpathcurveto{\pgfqpoint{0.608cm}{1.276cm}}{\pgfqpoint{0.643cm}{1.262cm}}{\pgfqpoint{0.679cm}{1.262cm}}
\pgfpathcurveto{\pgfqpoint{0.715cm}{1.262cm}}{\pgfqpoint{0.75cm}{1.276cm}}{\pgfqpoint{0.775cm}{1.302cm}}
\pgfpathcurveto{\pgfqpoint{0.801cm}{1.328cm}}{\pgfqpoint{0.815cm}{1.363cm}}{\pgfqpoint{0.815cm}{1.399cm}}
\pgfusepath{fill}
\pgfpathmoveto{\pgfqpoint{1.345cm}{1.371cm}}
\pgfpathcurveto{\pgfqpoint{1.345cm}{1.408cm}}{\pgfqpoint{1.331cm}{1.442cm}}{\pgfqpoint{1.305cm}{1.468cm}}
\pgfpathcurveto{\pgfqpoint{1.28cm}{1.494cm}}{\pgfqpoint{1.245cm}{1.508cm}}{\pgfqpoint{1.209cm}{1.508cm}}
\pgfpathcurveto{\pgfqpoint{1.172cm}{1.508cm}}{\pgfqpoint{1.138cm}{1.494cm}}{\pgfqpoint{1.112cm}{1.468cm}}
\pgfpathcurveto{\pgfqpoint{1.087cm}{1.442cm}}{\pgfqpoint{1.072cm}{1.408cm}}{\pgfqpoint{1.072cm}{1.371cm}}
\pgfpathcurveto{\pgfqpoint{1.072cm}{1.335cm}}{\pgfqpoint{1.087cm}{1.3cm}}{\pgfqpoint{1.112cm}{1.274cm}}
\pgfpathcurveto{\pgfqpoint{1.138cm}{1.249cm}}{\pgfqpoint{1.172cm}{1.234cm}}{\pgfqpoint{1.209cm}{1.234cm}}
\pgfpathcurveto{\pgfqpoint{1.245cm}{1.234cm}}{\pgfqpoint{1.28cm}{1.249cm}}{\pgfqpoint{1.305cm}{1.274cm}}
\pgfpathcurveto{\pgfqpoint{1.331cm}{1.3cm}}{\pgfqpoint{1.345cm}{1.335cm}}{\pgfqpoint{1.345cm}{1.371cm}}
\pgfusepath{fill}
\begin{pgfscope}
\pgfsetdash{}{0cm}
\pgfsetlinewidth{0.818mm}
\pgfsetroundcap
\pgfsetmiterlimit{4.0}
\pgfpathmoveto{\pgfqpoint{0.682cm}{0.671cm}}
\pgfpathlineto{\pgfqpoint{0.682cm}{0.042cm}}
\pgfusepath{stroke}
\end{pgfscope}
\end{pgfscope}
\end{pgfscope}
\end{pgfscope}
\end{tikzpicture}}}_{\varepsilon}\circ X_{\varepsilon},\qquad X^{\!\resizebox{!}{.8em}{
\begin{tikzpicture}
\pgfpathmoveto{\pgfqpoint{0cm}{-0.035cm}}
\pgfpathlineto{\pgfqpoint{1.976cm}{-0.035cm}}
\pgfpathlineto{\pgfqpoint{1.976cm}{1.94cm}}
\pgfpathlineto{\pgfqpoint{0cm}{1.94cm}}
\pgfpathclose
\pgfusepath{clip}
\begin{pgfscope}
\begin{pgfscope}
\pgfpathmoveto{\pgfqpoint{0cm}{-0.035cm}}
\pgfpathlineto{\pgfqpoint{1.976cm}{-0.035cm}}
\pgfpathlineto{\pgfqpoint{1.976cm}{1.94cm}}
\pgfpathlineto{\pgfqpoint{0cm}{1.94cm}}
\pgfpathclose
\pgfusepath{clip}
\begin{pgfscope}
\begin{pgfscope}
\pgfsetdash{}{0cm}
\pgfsetlinewidth{0.818mm}
\pgfsetroundcap
\pgfsetroundjoin
\pgfsetmiterlimit{7.0}
\definecolor{eps2pgf_color}{gray}{0}\pgfsetstrokecolor{eps2pgf_color}\pgfsetfillcolor{eps2pgf_color}
\pgfpathmoveto{\pgfqpoint{0.117cm}{1.815cm}}
\pgfpathlineto{\pgfqpoint{0.682cm}{1.065cm}}
\pgfpathlineto{\pgfqpoint{1.246cm}{1.815cm}}
\pgfusepath{stroke}
\end{pgfscope}
\definecolor{eps2pgf_color}{gray}{0}\pgfsetstrokecolor{eps2pgf_color}\pgfsetfillcolor{eps2pgf_color}
\pgfpathmoveto{\pgfqpoint{0.273cm}{1.789cm}}
\pgfpathcurveto{\pgfqpoint{0.273cm}{1.825cm}}{\pgfqpoint{0.259cm}{1.86cm}}{\pgfqpoint{0.233cm}{1.886cm}}
\pgfpathcurveto{\pgfqpoint{0.207cm}{1.912cm}}{\pgfqpoint{0.173cm}{1.926cm}}{\pgfqpoint{0.137cm}{1.926cm}}
\pgfpathcurveto{\pgfqpoint{0.1cm}{1.926cm}}{\pgfqpoint{0.066cm}{1.912cm}}{\pgfqpoint{0.04cm}{1.886cm}}
\pgfpathcurveto{\pgfqpoint{0.014cm}{1.86cm}}{\pgfqpoint{0cm}{1.825cm}}{\pgfqpoint{0cm}{1.789cm}}
\pgfpathcurveto{\pgfqpoint{0cm}{1.753cm}}{\pgfqpoint{0.014cm}{1.718cm}}{\pgfqpoint{0.04cm}{1.692cm}}
\pgfpathcurveto{\pgfqpoint{0.066cm}{1.667cm}}{\pgfqpoint{0.1cm}{1.652cm}}{\pgfqpoint{0.137cm}{1.652cm}}
\pgfpathcurveto{\pgfqpoint{0.173cm}{1.652cm}}{\pgfqpoint{0.207cm}{1.667cm}}{\pgfqpoint{0.233cm}{1.692cm}}
\pgfpathcurveto{\pgfqpoint{0.259cm}{1.718cm}}{\pgfqpoint{0.273cm}{1.753cm}}{\pgfqpoint{0.273cm}{1.789cm}}
\pgfusepath{fill}
\pgfpathmoveto{\pgfqpoint{1.345cm}{1.765cm}}
\pgfpathcurveto{\pgfqpoint{1.345cm}{1.801cm}}{\pgfqpoint{1.331cm}{1.836cm}}{\pgfqpoint{1.305cm}{1.862cm}}
\pgfpathcurveto{\pgfqpoint{1.28cm}{1.887cm}}{\pgfqpoint{1.245cm}{1.902cm}}{\pgfqpoint{1.209cm}{1.902cm}}
\pgfpathcurveto{\pgfqpoint{1.172cm}{1.902cm}}{\pgfqpoint{1.138cm}{1.887cm}}{\pgfqpoint{1.112cm}{1.862cm}}
\pgfpathcurveto{\pgfqpoint{1.087cm}{1.836cm}}{\pgfqpoint{1.072cm}{1.801cm}}{\pgfqpoint{1.072cm}{1.765cm}}
\pgfpathcurveto{\pgfqpoint{1.072cm}{1.728cm}}{\pgfqpoint{1.087cm}{1.694cm}}{\pgfqpoint{1.112cm}{1.668cm}}
\pgfpathcurveto{\pgfqpoint{1.138cm}{1.642cm}}{\pgfqpoint{1.172cm}{1.628cm}}{\pgfqpoint{1.209cm}{1.628cm}}
\pgfpathcurveto{\pgfqpoint{1.245cm}{1.628cm}}{\pgfqpoint{1.28cm}{1.642cm}}{\pgfqpoint{1.305cm}{1.668cm}}
\pgfpathcurveto{\pgfqpoint{1.331cm}{1.694cm}}{\pgfqpoint{1.345cm}{1.728cm}}{\pgfqpoint{1.345cm}{1.765cm}}
\pgfusepath{fill}
\begin{pgfscope}
\pgfsetdash{}{0cm}
\pgfsetlinewidth{0.818mm}
\pgfsetroundcap
\pgfsetroundjoin
\pgfsetmiterlimit{7.0}
\pgfpathmoveto{\pgfqpoint{0.682cm}{1.065cm}}
\pgfpathlineto{\pgfqpoint{1.246cm}{0.315cm}}
\pgfpathlineto{\pgfqpoint{1.811cm}{1.065cm}}
\pgfusepath{stroke}
\end{pgfscope}
\pgfpathmoveto{\pgfqpoint{1.948cm}{1.065cm}}
\pgfpathcurveto{\pgfqpoint{1.948cm}{1.101cm}}{\pgfqpoint{1.933cm}{1.136cm}}{\pgfqpoint{1.907cm}{1.162cm}}
\pgfpathcurveto{\pgfqpoint{1.882cm}{1.187cm}}{\pgfqpoint{1.847cm}{1.202cm}}{\pgfqpoint{1.811cm}{1.202cm}}
\pgfpathcurveto{\pgfqpoint{1.775cm}{1.202cm}}{\pgfqpoint{1.74cm}{1.187cm}}{\pgfqpoint{1.714cm}{1.162cm}}
\pgfpathcurveto{\pgfqpoint{1.689cm}{1.136cm}}{\pgfqpoint{1.674cm}{1.101cm}}{\pgfqpoint{1.674cm}{1.065cm}}
\pgfpathcurveto{\pgfqpoint{1.674cm}{1.029cm}}{\pgfqpoint{1.689cm}{0.994cm}}{\pgfqpoint{1.714cm}{0.968cm}}
\pgfpathcurveto{\pgfqpoint{1.74cm}{0.942cm}}{\pgfqpoint{1.775cm}{0.928cm}}{\pgfqpoint{1.811cm}{0.928cm}}
\pgfpathcurveto{\pgfqpoint{1.847cm}{0.928cm}}{\pgfqpoint{1.882cm}{0.942cm}}{\pgfqpoint{1.907cm}{0.968cm}}
\pgfpathcurveto{\pgfqpoint{1.933cm}{0.994cm}}{\pgfqpoint{1.948cm}{1.029cm}}{\pgfqpoint{1.948cm}{1.065cm}}
\pgfusepath{fill}
\begin{pgfscope}
\pgfsetdash{}{0cm}
\pgfsetlinewidth{0.818mm}
\pgfsetmiterlimit{7.0}
\pgfpathmoveto{\pgfqpoint{1.246cm}{0.315cm}}
\pgfpathlineto{\pgfqpoint{1.244cm}{1.061cm}}
\pgfusepath{stroke}
\end{pgfscope}
\pgfpathmoveto{\pgfqpoint{1.38cm}{1.065cm}}
\pgfpathcurveto{\pgfqpoint{1.38cm}{1.101cm}}{\pgfqpoint{1.366cm}{1.136cm}}{\pgfqpoint{1.34cm}{1.162cm}}
\pgfpathcurveto{\pgfqpoint{1.315cm}{1.187cm}}{\pgfqpoint{1.28cm}{1.202cm}}{\pgfqpoint{1.244cm}{1.202cm}}
\pgfpathcurveto{\pgfqpoint{1.207cm}{1.202cm}}{\pgfqpoint{1.173cm}{1.187cm}}{\pgfqpoint{1.147cm}{1.162cm}}
\pgfpathcurveto{\pgfqpoint{1.121cm}{1.136cm}}{\pgfqpoint{1.107cm}{1.101cm}}{\pgfqpoint{1.107cm}{1.065cm}}
\pgfpathcurveto{\pgfqpoint{1.107cm}{1.029cm}}{\pgfqpoint{1.121cm}{0.994cm}}{\pgfqpoint{1.147cm}{0.968cm}}
\pgfpathcurveto{\pgfqpoint{1.173cm}{0.942cm}}{\pgfqpoint{1.207cm}{0.928cm}}{\pgfqpoint{1.244cm}{0.928cm}}
\pgfpathcurveto{\pgfqpoint{1.28cm}{0.928cm}}{\pgfqpoint{1.315cm}{0.942cm}}{\pgfqpoint{1.34cm}{0.968cm}}
\pgfpathcurveto{\pgfqpoint{1.366cm}{0.994cm}}{\pgfqpoint{1.38cm}{1.029cm}}{\pgfqpoint{1.38cm}{1.065cm}}
\pgfusepath{fill}
\begin{pgfscope}
\pgfsetdash{}{0cm}
\pgfsetlinewidth{0.818mm}
\pgfsetmiterlimit{4.0}
\pgfpathmoveto{\pgfqpoint{1.383cm}{0.178cm}}
\pgfpathcurveto{\pgfqpoint{1.383cm}{0.214cm}}{\pgfqpoint{1.369cm}{0.249cm}}{\pgfqpoint{1.343cm}{0.275cm}}
\pgfpathcurveto{\pgfqpoint{1.317cm}{0.3cm}}{\pgfqpoint{1.283cm}{0.315cm}}{\pgfqpoint{1.246cm}{0.315cm}}
\pgfpathcurveto{\pgfqpoint{1.21cm}{0.315cm}}{\pgfqpoint{1.175cm}{0.3cm}}{\pgfqpoint{1.15cm}{0.275cm}}
\pgfpathcurveto{\pgfqpoint{1.124cm}{0.249cm}}{\pgfqpoint{1.11cm}{0.214cm}}{\pgfqpoint{1.11cm}{0.178cm}}
\pgfpathcurveto{\pgfqpoint{1.11cm}{0.141cm}}{\pgfqpoint{1.124cm}{0.107cm}}{\pgfqpoint{1.15cm}{0.081cm}}
\pgfpathcurveto{\pgfqpoint{1.175cm}{0.055cm}}{\pgfqpoint{1.21cm}{0.041cm}}{\pgfqpoint{1.246cm}{0.041cm}}
\pgfpathcurveto{\pgfqpoint{1.283cm}{0.041cm}}{\pgfqpoint{1.317cm}{0.055cm}}{\pgfqpoint{1.343cm}{0.081cm}}
\pgfpathcurveto{\pgfqpoint{1.369cm}{0.107cm}}{\pgfqpoint{1.383cm}{0.141cm}}{\pgfqpoint{1.383cm}{0.178cm}}
\pgfusepath{stroke}
\end{pgfscope}
\end{pgfscope}
\end{pgfscope}
\end{pgfscope}
\end{tikzpicture}}} = \lim_{\varepsilon\to 0}X^{\!\resizebox{0.6em}{!}{
\begin{tikzpicture}
\pgfpathmoveto{\pgfqpoint{0cm}{0cm}}
\pgfpathlineto{\pgfqpoint{1.376cm}{0cm}}
\pgfpathlineto{\pgfqpoint{1.376cm}{1.588cm}}
\pgfpathlineto{\pgfqpoint{0cm}{1.588cm}}
\pgfpathclose
\pgfusepath{clip}
\begin{pgfscope}
\begin{pgfscope}
\pgfpathmoveto{\pgfqpoint{0cm}{0cm}}
\pgfpathlineto{\pgfqpoint{1.376cm}{0cm}}
\pgfpathlineto{\pgfqpoint{1.376cm}{1.588cm}}
\pgfpathlineto{\pgfqpoint{0cm}{1.588cm}}
\pgfpathclose
\pgfusepath{clip}
\begin{pgfscope}
\begin{pgfscope}
\definecolor{eps2pgf_color}{gray}{0.976471}\pgfsetstrokecolor{eps2pgf_color}\pgfsetfillcolor{eps2pgf_color}
\pgfpathmoveto{\pgfqpoint{0cm}{0cm}}
\pgfpathlineto{\pgfqpoint{1.376cm}{0cm}}
\pgfpathlineto{\pgfqpoint{1.376cm}{1.588cm}}
\pgfpathlineto{\pgfqpoint{0cm}{1.588cm}}
\pgfpathclose
\pgfusepath{fill}
\end{pgfscope}
\begin{pgfscope}
\pgfsetdash{}{0cm}
\pgfsetlinewidth{0.818mm}
\pgfsetroundcap
\pgfsetroundjoin
\pgfsetmiterlimit{7.0}
\definecolor{eps2pgf_color}{gray}{0}\pgfsetstrokecolor{eps2pgf_color}\pgfsetfillcolor{eps2pgf_color}
\pgfpathmoveto{\pgfqpoint{0.117cm}{1.476cm}}
\pgfpathlineto{\pgfqpoint{0.682cm}{0.726cm}}
\pgfpathlineto{\pgfqpoint{1.246cm}{1.476cm}}
\pgfusepath{stroke}
\end{pgfscope}
\definecolor{eps2pgf_color}{gray}{0}\pgfsetstrokecolor{eps2pgf_color}\pgfsetfillcolor{eps2pgf_color}
\pgfpathmoveto{\pgfqpoint{0.273cm}{1.451cm}}
\pgfpathcurveto{\pgfqpoint{0.273cm}{1.487cm}}{\pgfqpoint{0.259cm}{1.522cm}}{\pgfqpoint{0.233cm}{1.547cm}}
\pgfpathcurveto{\pgfqpoint{0.207cm}{1.573cm}}{\pgfqpoint{0.173cm}{1.588cm}}{\pgfqpoint{0.137cm}{1.588cm}}
\pgfpathcurveto{\pgfqpoint{0.1cm}{1.588cm}}{\pgfqpoint{0.066cm}{1.573cm}}{\pgfqpoint{0.04cm}{1.547cm}}
\pgfpathcurveto{\pgfqpoint{0.014cm}{1.522cm}}{\pgfqpoint{0cm}{1.487cm}}{\pgfqpoint{0cm}{1.451cm}}
\pgfpathcurveto{\pgfqpoint{0cm}{1.414cm}}{\pgfqpoint{0.014cm}{1.379cm}}{\pgfqpoint{0.04cm}{1.354cm}}
\pgfpathcurveto{\pgfqpoint{0.066cm}{1.328cm}}{\pgfqpoint{0.1cm}{1.314cm}}{\pgfqpoint{0.137cm}{1.314cm}}
\pgfpathcurveto{\pgfqpoint{0.173cm}{1.314cm}}{\pgfqpoint{0.207cm}{1.328cm}}{\pgfqpoint{0.233cm}{1.354cm}}
\pgfpathcurveto{\pgfqpoint{0.259cm}{1.379cm}}{\pgfqpoint{0.273cm}{1.414cm}}{\pgfqpoint{0.273cm}{1.451cm}}
\pgfusepath{fill}
\pgfpathmoveto{\pgfqpoint{1.345cm}{1.426cm}}
\pgfpathcurveto{\pgfqpoint{1.345cm}{1.463cm}}{\pgfqpoint{1.331cm}{1.497cm}}{\pgfqpoint{1.305cm}{1.523cm}}
\pgfpathcurveto{\pgfqpoint{1.28cm}{1.549cm}}{\pgfqpoint{1.245cm}{1.563cm}}{\pgfqpoint{1.209cm}{1.563cm}}
\pgfpathcurveto{\pgfqpoint{1.172cm}{1.563cm}}{\pgfqpoint{1.138cm}{1.549cm}}{\pgfqpoint{1.112cm}{1.523cm}}
\pgfpathcurveto{\pgfqpoint{1.087cm}{1.497cm}}{\pgfqpoint{1.072cm}{1.463cm}}{\pgfqpoint{1.072cm}{1.426cm}}
\pgfpathcurveto{\pgfqpoint{1.072cm}{1.39cm}}{\pgfqpoint{1.087cm}{1.355cm}}{\pgfqpoint{1.112cm}{1.329cm}}
\pgfpathcurveto{\pgfqpoint{1.138cm}{1.304cm}}{\pgfqpoint{1.172cm}{1.289cm}}{\pgfqpoint{1.209cm}{1.289cm}}
\pgfpathcurveto{\pgfqpoint{1.245cm}{1.289cm}}{\pgfqpoint{1.28cm}{1.304cm}}{\pgfqpoint{1.305cm}{1.329cm}}
\pgfpathcurveto{\pgfqpoint{1.331cm}{1.355cm}}{\pgfqpoint{1.345cm}{1.39cm}}{\pgfqpoint{1.345cm}{1.426cm}}
\pgfusepath{fill}
\begin{pgfscope}
\pgfsetdash{}{0cm}
\pgfsetlinewidth{0.818mm}
\pgfsetroundcap
\pgfsetmiterlimit{4.0}
\pgfpathmoveto{\pgfqpoint{0.682cm}{0.726cm}}
\pgfpathlineto{\pgfqpoint{0.682cm}{0.097cm}}
\pgfusepath{stroke}
\end{pgfscope}
\end{pgfscope}
\end{pgfscope}
\end{pgfscope}
\end{tikzpicture}}}_{\varepsilon} \circ \llbracket X_{\varepsilon}^2 \rrbracket -\frac{b_{\varepsilon}}{3}, \qquad
   X^{\!\resizebox{!}{.8em}{
\begin{tikzpicture}
\pgfpathmoveto{\pgfqpoint{0cm}{-0.035cm}}
\pgfpathlineto{\pgfqpoint{1.976cm}{-0.035cm}}
\pgfpathlineto{\pgfqpoint{1.976cm}{1.94cm}}
\pgfpathlineto{\pgfqpoint{0cm}{1.94cm}}
\pgfpathclose
\pgfusepath{clip}
\begin{pgfscope}
\begin{pgfscope}
\pgfpathmoveto{\pgfqpoint{0cm}{-0.035cm}}
\pgfpathlineto{\pgfqpoint{1.976cm}{-0.035cm}}
\pgfpathlineto{\pgfqpoint{1.976cm}{1.94cm}}
\pgfpathlineto{\pgfqpoint{0cm}{1.94cm}}
\pgfpathclose
\pgfusepath{clip}
\begin{pgfscope}
\begin{pgfscope}
\pgfsetdash{}{0cm}
\pgfsetlinewidth{0.818mm}
\pgfsetroundcap
\pgfsetroundjoin
\pgfsetmiterlimit{7.0}
\definecolor{eps2pgf_color}{gray}{0}\pgfsetstrokecolor{eps2pgf_color}\pgfsetfillcolor{eps2pgf_color}
\pgfpathmoveto{\pgfqpoint{0.117cm}{1.815cm}}
\pgfpathlineto{\pgfqpoint{0.682cm}{1.065cm}}
\pgfpathlineto{\pgfqpoint{1.246cm}{1.815cm}}
\pgfusepath{stroke}
\end{pgfscope}
\definecolor{eps2pgf_color}{gray}{0}\pgfsetstrokecolor{eps2pgf_color}\pgfsetfillcolor{eps2pgf_color}
\pgfpathmoveto{\pgfqpoint{0.273cm}{1.789cm}}
\pgfpathcurveto{\pgfqpoint{0.273cm}{1.825cm}}{\pgfqpoint{0.259cm}{1.86cm}}{\pgfqpoint{0.233cm}{1.886cm}}
\pgfpathcurveto{\pgfqpoint{0.207cm}{1.912cm}}{\pgfqpoint{0.173cm}{1.926cm}}{\pgfqpoint{0.137cm}{1.926cm}}
\pgfpathcurveto{\pgfqpoint{0.1cm}{1.926cm}}{\pgfqpoint{0.066cm}{1.912cm}}{\pgfqpoint{0.04cm}{1.886cm}}
\pgfpathcurveto{\pgfqpoint{0.014cm}{1.86cm}}{\pgfqpoint{0cm}{1.825cm}}{\pgfqpoint{0cm}{1.789cm}}
\pgfpathcurveto{\pgfqpoint{0cm}{1.753cm}}{\pgfqpoint{0.014cm}{1.718cm}}{\pgfqpoint{0.04cm}{1.692cm}}
\pgfpathcurveto{\pgfqpoint{0.066cm}{1.667cm}}{\pgfqpoint{0.1cm}{1.652cm}}{\pgfqpoint{0.137cm}{1.652cm}}
\pgfpathcurveto{\pgfqpoint{0.173cm}{1.652cm}}{\pgfqpoint{0.207cm}{1.667cm}}{\pgfqpoint{0.233cm}{1.692cm}}
\pgfpathcurveto{\pgfqpoint{0.259cm}{1.718cm}}{\pgfqpoint{0.273cm}{1.753cm}}{\pgfqpoint{0.273cm}{1.789cm}}
\pgfusepath{fill}
\begin{pgfscope}
\pgfsetdash{}{0cm}
\pgfsetlinewidth{0.818mm}
\pgfsetmiterlimit{7.0}
\pgfpathmoveto{\pgfqpoint{0.682cm}{1.065cm}}
\pgfpathlineto{\pgfqpoint{0.679cm}{1.812cm}}
\pgfusepath{stroke}
\end{pgfscope}
\pgfpathmoveto{\pgfqpoint{0.815cm}{1.793cm}}
\pgfpathcurveto{\pgfqpoint{0.815cm}{1.829cm}}{\pgfqpoint{0.801cm}{1.864cm}}{\pgfqpoint{0.775cm}{1.89cm}}
\pgfpathcurveto{\pgfqpoint{0.75cm}{1.915cm}}{\pgfqpoint{0.715cm}{1.93cm}}{\pgfqpoint{0.679cm}{1.93cm}}
\pgfpathcurveto{\pgfqpoint{0.643cm}{1.93cm}}{\pgfqpoint{0.608cm}{1.915cm}}{\pgfqpoint{0.582cm}{1.89cm}}
\pgfpathcurveto{\pgfqpoint{0.557cm}{1.864cm}}{\pgfqpoint{0.542cm}{1.829cm}}{\pgfqpoint{0.542cm}{1.793cm}}
\pgfpathcurveto{\pgfqpoint{0.542cm}{1.756cm}}{\pgfqpoint{0.557cm}{1.722cm}}{\pgfqpoint{0.582cm}{1.696cm}}
\pgfpathcurveto{\pgfqpoint{0.608cm}{1.67cm}}{\pgfqpoint{0.643cm}{1.656cm}}{\pgfqpoint{0.679cm}{1.656cm}}
\pgfpathcurveto{\pgfqpoint{0.715cm}{1.656cm}}{\pgfqpoint{0.75cm}{1.67cm}}{\pgfqpoint{0.775cm}{1.696cm}}
\pgfpathcurveto{\pgfqpoint{0.801cm}{1.722cm}}{\pgfqpoint{0.815cm}{1.756cm}}{\pgfqpoint{0.815cm}{1.793cm}}
\pgfusepath{fill}
\pgfpathmoveto{\pgfqpoint{1.345cm}{1.765cm}}
\pgfpathcurveto{\pgfqpoint{1.345cm}{1.801cm}}{\pgfqpoint{1.331cm}{1.836cm}}{\pgfqpoint{1.305cm}{1.862cm}}
\pgfpathcurveto{\pgfqpoint{1.28cm}{1.887cm}}{\pgfqpoint{1.245cm}{1.902cm}}{\pgfqpoint{1.209cm}{1.902cm}}
\pgfpathcurveto{\pgfqpoint{1.172cm}{1.902cm}}{\pgfqpoint{1.138cm}{1.887cm}}{\pgfqpoint{1.112cm}{1.862cm}}
\pgfpathcurveto{\pgfqpoint{1.087cm}{1.836cm}}{\pgfqpoint{1.072cm}{1.801cm}}{\pgfqpoint{1.072cm}{1.765cm}}
\pgfpathcurveto{\pgfqpoint{1.072cm}{1.728cm}}{\pgfqpoint{1.087cm}{1.694cm}}{\pgfqpoint{1.112cm}{1.668cm}}
\pgfpathcurveto{\pgfqpoint{1.138cm}{1.642cm}}{\pgfqpoint{1.172cm}{1.628cm}}{\pgfqpoint{1.209cm}{1.628cm}}
\pgfpathcurveto{\pgfqpoint{1.245cm}{1.628cm}}{\pgfqpoint{1.28cm}{1.642cm}}{\pgfqpoint{1.305cm}{1.668cm}}
\pgfpathcurveto{\pgfqpoint{1.331cm}{1.694cm}}{\pgfqpoint{1.345cm}{1.728cm}}{\pgfqpoint{1.345cm}{1.765cm}}
\pgfusepath{fill}
\begin{pgfscope}
\pgfsetdash{}{0cm}
\pgfsetlinewidth{0.818mm}
\pgfsetroundcap
\pgfsetroundjoin
\pgfsetmiterlimit{7.0}
\pgfpathmoveto{\pgfqpoint{0.682cm}{1.065cm}}
\pgfpathlineto{\pgfqpoint{1.246cm}{0.315cm}}
\pgfpathlineto{\pgfqpoint{1.811cm}{1.065cm}}
\pgfusepath{stroke}
\end{pgfscope}
\pgfpathmoveto{\pgfqpoint{1.948cm}{1.065cm}}
\pgfpathcurveto{\pgfqpoint{1.948cm}{1.101cm}}{\pgfqpoint{1.933cm}{1.136cm}}{\pgfqpoint{1.907cm}{1.162cm}}
\pgfpathcurveto{\pgfqpoint{1.882cm}{1.187cm}}{\pgfqpoint{1.847cm}{1.202cm}}{\pgfqpoint{1.811cm}{1.202cm}}
\pgfpathcurveto{\pgfqpoint{1.775cm}{1.202cm}}{\pgfqpoint{1.74cm}{1.187cm}}{\pgfqpoint{1.714cm}{1.162cm}}
\pgfpathcurveto{\pgfqpoint{1.689cm}{1.136cm}}{\pgfqpoint{1.674cm}{1.101cm}}{\pgfqpoint{1.674cm}{1.065cm}}
\pgfpathcurveto{\pgfqpoint{1.674cm}{1.029cm}}{\pgfqpoint{1.689cm}{0.994cm}}{\pgfqpoint{1.714cm}{0.968cm}}
\pgfpathcurveto{\pgfqpoint{1.74cm}{0.942cm}}{\pgfqpoint{1.775cm}{0.928cm}}{\pgfqpoint{1.811cm}{0.928cm}}
\pgfpathcurveto{\pgfqpoint{1.847cm}{0.928cm}}{\pgfqpoint{1.882cm}{0.942cm}}{\pgfqpoint{1.907cm}{0.968cm}}
\pgfpathcurveto{\pgfqpoint{1.933cm}{0.994cm}}{\pgfqpoint{1.948cm}{1.029cm}}{\pgfqpoint{1.948cm}{1.065cm}}
\pgfusepath{fill}
\begin{pgfscope}
\pgfsetdash{}{0cm}
\pgfsetlinewidth{0.818mm}
\pgfsetmiterlimit{7.0}
\pgfpathmoveto{\pgfqpoint{1.246cm}{0.315cm}}
\pgfpathlineto{\pgfqpoint{1.244cm}{1.061cm}}
\pgfusepath{stroke}
\end{pgfscope}
\pgfpathmoveto{\pgfqpoint{1.38cm}{1.065cm}}
\pgfpathcurveto{\pgfqpoint{1.38cm}{1.101cm}}{\pgfqpoint{1.366cm}{1.136cm}}{\pgfqpoint{1.34cm}{1.162cm}}
\pgfpathcurveto{\pgfqpoint{1.315cm}{1.187cm}}{\pgfqpoint{1.28cm}{1.202cm}}{\pgfqpoint{1.244cm}{1.202cm}}
\pgfpathcurveto{\pgfqpoint{1.207cm}{1.202cm}}{\pgfqpoint{1.173cm}{1.187cm}}{\pgfqpoint{1.147cm}{1.162cm}}
\pgfpathcurveto{\pgfqpoint{1.121cm}{1.136cm}}{\pgfqpoint{1.107cm}{1.101cm}}{\pgfqpoint{1.107cm}{1.065cm}}
\pgfpathcurveto{\pgfqpoint{1.107cm}{1.029cm}}{\pgfqpoint{1.121cm}{0.994cm}}{\pgfqpoint{1.147cm}{0.968cm}}
\pgfpathcurveto{\pgfqpoint{1.173cm}{0.942cm}}{\pgfqpoint{1.207cm}{0.928cm}}{\pgfqpoint{1.244cm}{0.928cm}}
\pgfpathcurveto{\pgfqpoint{1.28cm}{0.928cm}}{\pgfqpoint{1.315cm}{0.942cm}}{\pgfqpoint{1.34cm}{0.968cm}}
\pgfpathcurveto{\pgfqpoint{1.366cm}{0.994cm}}{\pgfqpoint{1.38cm}{1.029cm}}{\pgfqpoint{1.38cm}{1.065cm}}
\pgfusepath{fill}
\begin{pgfscope}
\pgfsetdash{}{0cm}
\pgfsetlinewidth{0.818mm}
\pgfsetmiterlimit{4.0}
\pgfpathmoveto{\pgfqpoint{1.383cm}{0.178cm}}
\pgfpathcurveto{\pgfqpoint{1.383cm}{0.214cm}}{\pgfqpoint{1.369cm}{0.249cm}}{\pgfqpoint{1.343cm}{0.275cm}}
\pgfpathcurveto{\pgfqpoint{1.317cm}{0.3cm}}{\pgfqpoint{1.283cm}{0.315cm}}{\pgfqpoint{1.246cm}{0.315cm}}
\pgfpathcurveto{\pgfqpoint{1.21cm}{0.315cm}}{\pgfqpoint{1.175cm}{0.3cm}}{\pgfqpoint{1.15cm}{0.275cm}}
\pgfpathcurveto{\pgfqpoint{1.124cm}{0.249cm}}{\pgfqpoint{1.11cm}{0.214cm}}{\pgfqpoint{1.11cm}{0.178cm}}
\pgfpathcurveto{\pgfqpoint{1.11cm}{0.141cm}}{\pgfqpoint{1.124cm}{0.107cm}}{\pgfqpoint{1.15cm}{0.081cm}}
\pgfpathcurveto{\pgfqpoint{1.175cm}{0.055cm}}{\pgfqpoint{1.21cm}{0.041cm}}{\pgfqpoint{1.246cm}{0.041cm}}
\pgfpathcurveto{\pgfqpoint{1.283cm}{0.041cm}}{\pgfqpoint{1.317cm}{0.055cm}}{\pgfqpoint{1.343cm}{0.081cm}}
\pgfpathcurveto{\pgfqpoint{1.369cm}{0.107cm}}{\pgfqpoint{1.383cm}{0.141cm}}{\pgfqpoint{1.383cm}{0.178cm}}
\pgfusepath{stroke}
\end{pgfscope}
\end{pgfscope}
\end{pgfscope}
\end{pgfscope}
\end{tikzpicture}}} = \lim_{\varepsilon\to 0}X^{\!\resizebox{0.6em}{!}{
\begin{tikzpicture}
\pgfpathmoveto{\pgfqpoint{0cm}{-0.035cm}}
\pgfpathlineto{\pgfqpoint{1.376cm}{-0.035cm}}
\pgfpathlineto{\pgfqpoint{1.376cm}{1.552cm}}
\pgfpathlineto{\pgfqpoint{0cm}{1.552cm}}
\pgfpathclose
\pgfusepath{clip}
\begin{pgfscope}
\begin{pgfscope}
\pgfpathmoveto{\pgfqpoint{0cm}{-0.035cm}}
\pgfpathlineto{\pgfqpoint{1.376cm}{-0.035cm}}
\pgfpathlineto{\pgfqpoint{1.376cm}{1.552cm}}
\pgfpathlineto{\pgfqpoint{0cm}{1.552cm}}
\pgfpathclose
\pgfusepath{clip}
\begin{pgfscope}
\begin{pgfscope}
\pgfsetdash{}{0cm}
\pgfsetlinewidth{0.818mm}
\pgfsetroundcap
\pgfsetroundjoin
\pgfsetmiterlimit{7.0}
\definecolor{eps2pgf_color}{gray}{0}\pgfsetstrokecolor{eps2pgf_color}\pgfsetfillcolor{eps2pgf_color}
\pgfpathmoveto{\pgfqpoint{0.117cm}{1.421cm}}
\pgfpathlineto{\pgfqpoint{0.682cm}{0.671cm}}
\pgfpathlineto{\pgfqpoint{1.246cm}{1.421cm}}
\pgfusepath{stroke}
\end{pgfscope}
\definecolor{eps2pgf_color}{gray}{0}\pgfsetstrokecolor{eps2pgf_color}\pgfsetfillcolor{eps2pgf_color}
\pgfpathmoveto{\pgfqpoint{0.273cm}{1.395cm}}
\pgfpathcurveto{\pgfqpoint{0.273cm}{1.432cm}}{\pgfqpoint{0.259cm}{1.467cm}}{\pgfqpoint{0.233cm}{1.492cm}}
\pgfpathcurveto{\pgfqpoint{0.207cm}{1.518cm}}{\pgfqpoint{0.173cm}{1.532cm}}{\pgfqpoint{0.137cm}{1.532cm}}
\pgfpathcurveto{\pgfqpoint{0.1cm}{1.532cm}}{\pgfqpoint{0.066cm}{1.518cm}}{\pgfqpoint{0.04cm}{1.492cm}}
\pgfpathcurveto{\pgfqpoint{0.014cm}{1.467cm}}{\pgfqpoint{0cm}{1.432cm}}{\pgfqpoint{0cm}{1.395cm}}
\pgfpathcurveto{\pgfqpoint{0cm}{1.359cm}}{\pgfqpoint{0.014cm}{1.324cm}}{\pgfqpoint{0.04cm}{1.299cm}}
\pgfpathcurveto{\pgfqpoint{0.066cm}{1.273cm}}{\pgfqpoint{0.1cm}{1.258cm}}{\pgfqpoint{0.137cm}{1.258cm}}
\pgfpathcurveto{\pgfqpoint{0.173cm}{1.258cm}}{\pgfqpoint{0.207cm}{1.273cm}}{\pgfqpoint{0.233cm}{1.299cm}}
\pgfpathcurveto{\pgfqpoint{0.259cm}{1.324cm}}{\pgfqpoint{0.273cm}{1.359cm}}{\pgfqpoint{0.273cm}{1.395cm}}
\pgfusepath{fill}
\begin{pgfscope}
\pgfsetdash{}{0cm}
\pgfsetlinewidth{0.818mm}
\pgfsetmiterlimit{7.0}
\pgfpathmoveto{\pgfqpoint{0.682cm}{0.671cm}}
\pgfpathlineto{\pgfqpoint{0.679cm}{1.418cm}}
\pgfusepath{stroke}
\end{pgfscope}
\pgfpathmoveto{\pgfqpoint{0.815cm}{1.399cm}}
\pgfpathcurveto{\pgfqpoint{0.815cm}{1.435cm}}{\pgfqpoint{0.801cm}{1.47cm}}{\pgfqpoint{0.775cm}{1.496cm}}
\pgfpathcurveto{\pgfqpoint{0.75cm}{1.521cm}}{\pgfqpoint{0.715cm}{1.536cm}}{\pgfqpoint{0.679cm}{1.536cm}}
\pgfpathcurveto{\pgfqpoint{0.643cm}{1.536cm}}{\pgfqpoint{0.608cm}{1.521cm}}{\pgfqpoint{0.582cm}{1.496cm}}
\pgfpathcurveto{\pgfqpoint{0.557cm}{1.47cm}}{\pgfqpoint{0.542cm}{1.435cm}}{\pgfqpoint{0.542cm}{1.399cm}}
\pgfpathcurveto{\pgfqpoint{0.542cm}{1.363cm}}{\pgfqpoint{0.557cm}{1.328cm}}{\pgfqpoint{0.582cm}{1.302cm}}
\pgfpathcurveto{\pgfqpoint{0.608cm}{1.276cm}}{\pgfqpoint{0.643cm}{1.262cm}}{\pgfqpoint{0.679cm}{1.262cm}}
\pgfpathcurveto{\pgfqpoint{0.715cm}{1.262cm}}{\pgfqpoint{0.75cm}{1.276cm}}{\pgfqpoint{0.775cm}{1.302cm}}
\pgfpathcurveto{\pgfqpoint{0.801cm}{1.328cm}}{\pgfqpoint{0.815cm}{1.363cm}}{\pgfqpoint{0.815cm}{1.399cm}}
\pgfusepath{fill}
\pgfpathmoveto{\pgfqpoint{1.345cm}{1.371cm}}
\pgfpathcurveto{\pgfqpoint{1.345cm}{1.408cm}}{\pgfqpoint{1.331cm}{1.442cm}}{\pgfqpoint{1.305cm}{1.468cm}}
\pgfpathcurveto{\pgfqpoint{1.28cm}{1.494cm}}{\pgfqpoint{1.245cm}{1.508cm}}{\pgfqpoint{1.209cm}{1.508cm}}
\pgfpathcurveto{\pgfqpoint{1.172cm}{1.508cm}}{\pgfqpoint{1.138cm}{1.494cm}}{\pgfqpoint{1.112cm}{1.468cm}}
\pgfpathcurveto{\pgfqpoint{1.087cm}{1.442cm}}{\pgfqpoint{1.072cm}{1.408cm}}{\pgfqpoint{1.072cm}{1.371cm}}
\pgfpathcurveto{\pgfqpoint{1.072cm}{1.335cm}}{\pgfqpoint{1.087cm}{1.3cm}}{\pgfqpoint{1.112cm}{1.274cm}}
\pgfpathcurveto{\pgfqpoint{1.138cm}{1.249cm}}{\pgfqpoint{1.172cm}{1.234cm}}{\pgfqpoint{1.209cm}{1.234cm}}
\pgfpathcurveto{\pgfqpoint{1.245cm}{1.234cm}}{\pgfqpoint{1.28cm}{1.249cm}}{\pgfqpoint{1.305cm}{1.274cm}}
\pgfpathcurveto{\pgfqpoint{1.331cm}{1.3cm}}{\pgfqpoint{1.345cm}{1.335cm}}{\pgfqpoint{1.345cm}{1.371cm}}
\pgfusepath{fill}
\begin{pgfscope}
\pgfsetdash{}{0cm}
\pgfsetlinewidth{0.818mm}
\pgfsetroundcap
\pgfsetmiterlimit{4.0}
\pgfpathmoveto{\pgfqpoint{0.682cm}{0.671cm}}
\pgfpathlineto{\pgfqpoint{0.682cm}{0.042cm}}
\pgfusepath{stroke}
\end{pgfscope}
\end{pgfscope}
\end{pgfscope}
\end{pgfscope}
\end{tikzpicture}}}_{\varepsilon} \circ \llbracket X_{\varepsilon}^2 \rrbracket -  b_{\varepsilon}X_{\varepsilon}, \]
   where $b_{\varepsilon}(t) = 3 \mathbbm{E}[(X^{\!\resizebox{0.6em}{!}{
\begin{tikzpicture}
\pgfpathmoveto{\pgfqpoint{0cm}{0cm}}
\pgfpathlineto{\pgfqpoint{1.376cm}{0cm}}
\pgfpathlineto{\pgfqpoint{1.376cm}{1.588cm}}
\pgfpathlineto{\pgfqpoint{0cm}{1.588cm}}
\pgfpathclose
\pgfusepath{clip}
\begin{pgfscope}
\begin{pgfscope}
\pgfpathmoveto{\pgfqpoint{0cm}{0cm}}
\pgfpathlineto{\pgfqpoint{1.376cm}{0cm}}
\pgfpathlineto{\pgfqpoint{1.376cm}{1.588cm}}
\pgfpathlineto{\pgfqpoint{0cm}{1.588cm}}
\pgfpathclose
\pgfusepath{clip}
\begin{pgfscope}
\begin{pgfscope}
\definecolor{eps2pgf_color}{gray}{0.976471}\pgfsetstrokecolor{eps2pgf_color}\pgfsetfillcolor{eps2pgf_color}
\pgfpathmoveto{\pgfqpoint{0cm}{0cm}}
\pgfpathlineto{\pgfqpoint{1.376cm}{0cm}}
\pgfpathlineto{\pgfqpoint{1.376cm}{1.588cm}}
\pgfpathlineto{\pgfqpoint{0cm}{1.588cm}}
\pgfpathclose
\pgfusepath{fill}
\end{pgfscope}
\begin{pgfscope}
\pgfsetdash{}{0cm}
\pgfsetlinewidth{0.818mm}
\pgfsetroundcap
\pgfsetroundjoin
\pgfsetmiterlimit{7.0}
\definecolor{eps2pgf_color}{gray}{0}\pgfsetstrokecolor{eps2pgf_color}\pgfsetfillcolor{eps2pgf_color}
\pgfpathmoveto{\pgfqpoint{0.117cm}{1.476cm}}
\pgfpathlineto{\pgfqpoint{0.682cm}{0.726cm}}
\pgfpathlineto{\pgfqpoint{1.246cm}{1.476cm}}
\pgfusepath{stroke}
\end{pgfscope}
\definecolor{eps2pgf_color}{gray}{0}\pgfsetstrokecolor{eps2pgf_color}\pgfsetfillcolor{eps2pgf_color}
\pgfpathmoveto{\pgfqpoint{0.273cm}{1.451cm}}
\pgfpathcurveto{\pgfqpoint{0.273cm}{1.487cm}}{\pgfqpoint{0.259cm}{1.522cm}}{\pgfqpoint{0.233cm}{1.547cm}}
\pgfpathcurveto{\pgfqpoint{0.207cm}{1.573cm}}{\pgfqpoint{0.173cm}{1.588cm}}{\pgfqpoint{0.137cm}{1.588cm}}
\pgfpathcurveto{\pgfqpoint{0.1cm}{1.588cm}}{\pgfqpoint{0.066cm}{1.573cm}}{\pgfqpoint{0.04cm}{1.547cm}}
\pgfpathcurveto{\pgfqpoint{0.014cm}{1.522cm}}{\pgfqpoint{0cm}{1.487cm}}{\pgfqpoint{0cm}{1.451cm}}
\pgfpathcurveto{\pgfqpoint{0cm}{1.414cm}}{\pgfqpoint{0.014cm}{1.379cm}}{\pgfqpoint{0.04cm}{1.354cm}}
\pgfpathcurveto{\pgfqpoint{0.066cm}{1.328cm}}{\pgfqpoint{0.1cm}{1.314cm}}{\pgfqpoint{0.137cm}{1.314cm}}
\pgfpathcurveto{\pgfqpoint{0.173cm}{1.314cm}}{\pgfqpoint{0.207cm}{1.328cm}}{\pgfqpoint{0.233cm}{1.354cm}}
\pgfpathcurveto{\pgfqpoint{0.259cm}{1.379cm}}{\pgfqpoint{0.273cm}{1.414cm}}{\pgfqpoint{0.273cm}{1.451cm}}
\pgfusepath{fill}
\pgfpathmoveto{\pgfqpoint{1.345cm}{1.426cm}}
\pgfpathcurveto{\pgfqpoint{1.345cm}{1.463cm}}{\pgfqpoint{1.331cm}{1.497cm}}{\pgfqpoint{1.305cm}{1.523cm}}
\pgfpathcurveto{\pgfqpoint{1.28cm}{1.549cm}}{\pgfqpoint{1.245cm}{1.563cm}}{\pgfqpoint{1.209cm}{1.563cm}}
\pgfpathcurveto{\pgfqpoint{1.172cm}{1.563cm}}{\pgfqpoint{1.138cm}{1.549cm}}{\pgfqpoint{1.112cm}{1.523cm}}
\pgfpathcurveto{\pgfqpoint{1.087cm}{1.497cm}}{\pgfqpoint{1.072cm}{1.463cm}}{\pgfqpoint{1.072cm}{1.426cm}}
\pgfpathcurveto{\pgfqpoint{1.072cm}{1.39cm}}{\pgfqpoint{1.087cm}{1.355cm}}{\pgfqpoint{1.112cm}{1.329cm}}
\pgfpathcurveto{\pgfqpoint{1.138cm}{1.304cm}}{\pgfqpoint{1.172cm}{1.289cm}}{\pgfqpoint{1.209cm}{1.289cm}}
\pgfpathcurveto{\pgfqpoint{1.245cm}{1.289cm}}{\pgfqpoint{1.28cm}{1.304cm}}{\pgfqpoint{1.305cm}{1.329cm}}
\pgfpathcurveto{\pgfqpoint{1.331cm}{1.355cm}}{\pgfqpoint{1.345cm}{1.39cm}}{\pgfqpoint{1.345cm}{1.426cm}}
\pgfusepath{fill}
\begin{pgfscope}
\pgfsetdash{}{0cm}
\pgfsetlinewidth{0.818mm}
\pgfsetroundcap
\pgfsetmiterlimit{4.0}
\pgfpathmoveto{\pgfqpoint{0.682cm}{0.726cm}}
\pgfpathlineto{\pgfqpoint{0.682cm}{0.097cm}}
\pgfusepath{stroke}
\end{pgfscope}
\end{pgfscope}
\end{pgfscope}
\end{pgfscope}
\end{tikzpicture}}}_{\varepsilon} \circ \llbracket X_{\varepsilon}^2 \rrbracket )(t,0)]$ stands for a suitable renormalization constant which is $t$ dependent and such that $\sup_{t\ge 0} |b_\varepsilon(t)| \lesssim |\log \varepsilon|$. Moreover, it can be seen that, for each fixed $\varepsilon$, $b_{\varepsilon}$ is smooth and has bounded first derivative.

 \begin{theorem}\label{thm:renorm43}
Let $d=3$. Let $\rho(t,x)=\langle (t,x)\rangle^{-\nu}$ for some $\nu>0$. Then there exist random distributions
\begin{equation}\label{eq:r43}
X,\llbracket X^{2}\rrbracket, X^{\!\resizebox{0.6em}{!}{
\begin{tikzpicture}
\pgfpathmoveto{\pgfqpoint{0cm}{-0.035cm}}
\pgfpathlineto{\pgfqpoint{1.376cm}{-0.035cm}}
\pgfpathlineto{\pgfqpoint{1.376cm}{1.552cm}}
\pgfpathlineto{\pgfqpoint{0cm}{1.552cm}}
\pgfpathclose
\pgfusepath{clip}
\begin{pgfscope}
\begin{pgfscope}
\pgfpathmoveto{\pgfqpoint{0cm}{-0.035cm}}
\pgfpathlineto{\pgfqpoint{1.376cm}{-0.035cm}}
\pgfpathlineto{\pgfqpoint{1.376cm}{1.552cm}}
\pgfpathlineto{\pgfqpoint{0cm}{1.552cm}}
\pgfpathclose
\pgfusepath{clip}
\begin{pgfscope}
\begin{pgfscope}
\pgfsetdash{}{0cm}
\pgfsetlinewidth{0.818mm}
\pgfsetroundcap
\pgfsetroundjoin
\pgfsetmiterlimit{7.0}
\definecolor{eps2pgf_color}{gray}{0}\pgfsetstrokecolor{eps2pgf_color}\pgfsetfillcolor{eps2pgf_color}
\pgfpathmoveto{\pgfqpoint{0.117cm}{1.421cm}}
\pgfpathlineto{\pgfqpoint{0.682cm}{0.671cm}}
\pgfpathlineto{\pgfqpoint{1.246cm}{1.421cm}}
\pgfusepath{stroke}
\end{pgfscope}
\definecolor{eps2pgf_color}{gray}{0}\pgfsetstrokecolor{eps2pgf_color}\pgfsetfillcolor{eps2pgf_color}
\pgfpathmoveto{\pgfqpoint{0.273cm}{1.395cm}}
\pgfpathcurveto{\pgfqpoint{0.273cm}{1.432cm}}{\pgfqpoint{0.259cm}{1.467cm}}{\pgfqpoint{0.233cm}{1.492cm}}
\pgfpathcurveto{\pgfqpoint{0.207cm}{1.518cm}}{\pgfqpoint{0.173cm}{1.532cm}}{\pgfqpoint{0.137cm}{1.532cm}}
\pgfpathcurveto{\pgfqpoint{0.1cm}{1.532cm}}{\pgfqpoint{0.066cm}{1.518cm}}{\pgfqpoint{0.04cm}{1.492cm}}
\pgfpathcurveto{\pgfqpoint{0.014cm}{1.467cm}}{\pgfqpoint{0cm}{1.432cm}}{\pgfqpoint{0cm}{1.395cm}}
\pgfpathcurveto{\pgfqpoint{0cm}{1.359cm}}{\pgfqpoint{0.014cm}{1.324cm}}{\pgfqpoint{0.04cm}{1.299cm}}
\pgfpathcurveto{\pgfqpoint{0.066cm}{1.273cm}}{\pgfqpoint{0.1cm}{1.258cm}}{\pgfqpoint{0.137cm}{1.258cm}}
\pgfpathcurveto{\pgfqpoint{0.173cm}{1.258cm}}{\pgfqpoint{0.207cm}{1.273cm}}{\pgfqpoint{0.233cm}{1.299cm}}
\pgfpathcurveto{\pgfqpoint{0.259cm}{1.324cm}}{\pgfqpoint{0.273cm}{1.359cm}}{\pgfqpoint{0.273cm}{1.395cm}}
\pgfusepath{fill}
\begin{pgfscope}
\pgfsetdash{}{0cm}
\pgfsetlinewidth{0.818mm}
\pgfsetmiterlimit{7.0}
\pgfpathmoveto{\pgfqpoint{0.682cm}{0.671cm}}
\pgfpathlineto{\pgfqpoint{0.679cm}{1.418cm}}
\pgfusepath{stroke}
\end{pgfscope}
\pgfpathmoveto{\pgfqpoint{0.815cm}{1.399cm}}
\pgfpathcurveto{\pgfqpoint{0.815cm}{1.435cm}}{\pgfqpoint{0.801cm}{1.47cm}}{\pgfqpoint{0.775cm}{1.496cm}}
\pgfpathcurveto{\pgfqpoint{0.75cm}{1.521cm}}{\pgfqpoint{0.715cm}{1.536cm}}{\pgfqpoint{0.679cm}{1.536cm}}
\pgfpathcurveto{\pgfqpoint{0.643cm}{1.536cm}}{\pgfqpoint{0.608cm}{1.521cm}}{\pgfqpoint{0.582cm}{1.496cm}}
\pgfpathcurveto{\pgfqpoint{0.557cm}{1.47cm}}{\pgfqpoint{0.542cm}{1.435cm}}{\pgfqpoint{0.542cm}{1.399cm}}
\pgfpathcurveto{\pgfqpoint{0.542cm}{1.363cm}}{\pgfqpoint{0.557cm}{1.328cm}}{\pgfqpoint{0.582cm}{1.302cm}}
\pgfpathcurveto{\pgfqpoint{0.608cm}{1.276cm}}{\pgfqpoint{0.643cm}{1.262cm}}{\pgfqpoint{0.679cm}{1.262cm}}
\pgfpathcurveto{\pgfqpoint{0.715cm}{1.262cm}}{\pgfqpoint{0.75cm}{1.276cm}}{\pgfqpoint{0.775cm}{1.302cm}}
\pgfpathcurveto{\pgfqpoint{0.801cm}{1.328cm}}{\pgfqpoint{0.815cm}{1.363cm}}{\pgfqpoint{0.815cm}{1.399cm}}
\pgfusepath{fill}
\pgfpathmoveto{\pgfqpoint{1.345cm}{1.371cm}}
\pgfpathcurveto{\pgfqpoint{1.345cm}{1.408cm}}{\pgfqpoint{1.331cm}{1.442cm}}{\pgfqpoint{1.305cm}{1.468cm}}
\pgfpathcurveto{\pgfqpoint{1.28cm}{1.494cm}}{\pgfqpoint{1.245cm}{1.508cm}}{\pgfqpoint{1.209cm}{1.508cm}}
\pgfpathcurveto{\pgfqpoint{1.172cm}{1.508cm}}{\pgfqpoint{1.138cm}{1.494cm}}{\pgfqpoint{1.112cm}{1.468cm}}
\pgfpathcurveto{\pgfqpoint{1.087cm}{1.442cm}}{\pgfqpoint{1.072cm}{1.408cm}}{\pgfqpoint{1.072cm}{1.371cm}}
\pgfpathcurveto{\pgfqpoint{1.072cm}{1.335cm}}{\pgfqpoint{1.087cm}{1.3cm}}{\pgfqpoint{1.112cm}{1.274cm}}
\pgfpathcurveto{\pgfqpoint{1.138cm}{1.249cm}}{\pgfqpoint{1.172cm}{1.234cm}}{\pgfqpoint{1.209cm}{1.234cm}}
\pgfpathcurveto{\pgfqpoint{1.245cm}{1.234cm}}{\pgfqpoint{1.28cm}{1.249cm}}{\pgfqpoint{1.305cm}{1.274cm}}
\pgfpathcurveto{\pgfqpoint{1.331cm}{1.3cm}}{\pgfqpoint{1.345cm}{1.335cm}}{\pgfqpoint{1.345cm}{1.371cm}}
\pgfusepath{fill}
\begin{pgfscope}
\pgfsetdash{}{0cm}
\pgfsetlinewidth{0.818mm}
\pgfsetroundcap
\pgfsetmiterlimit{4.0}
\pgfpathmoveto{\pgfqpoint{0.682cm}{0.671cm}}
\pgfpathlineto{\pgfqpoint{0.682cm}{0.042cm}}
\pgfusepath{stroke}
\end{pgfscope}
\end{pgfscope}
\end{pgfscope}
\end{pgfscope}
\end{tikzpicture}}}, X^{\!\resizebox{0.6em}{!}{
\begin{tikzpicture}
\pgfpathmoveto{\pgfqpoint{0cm}{0cm}}
\pgfpathlineto{\pgfqpoint{1.376cm}{0cm}}
\pgfpathlineto{\pgfqpoint{1.376cm}{1.588cm}}
\pgfpathlineto{\pgfqpoint{0cm}{1.588cm}}
\pgfpathclose
\pgfusepath{clip}
\begin{pgfscope}
\begin{pgfscope}
\pgfpathmoveto{\pgfqpoint{0cm}{0cm}}
\pgfpathlineto{\pgfqpoint{1.376cm}{0cm}}
\pgfpathlineto{\pgfqpoint{1.376cm}{1.588cm}}
\pgfpathlineto{\pgfqpoint{0cm}{1.588cm}}
\pgfpathclose
\pgfusepath{clip}
\begin{pgfscope}
\begin{pgfscope}
\definecolor{eps2pgf_color}{gray}{0.976471}\pgfsetstrokecolor{eps2pgf_color}\pgfsetfillcolor{eps2pgf_color}
\pgfpathmoveto{\pgfqpoint{0cm}{0cm}}
\pgfpathlineto{\pgfqpoint{1.376cm}{0cm}}
\pgfpathlineto{\pgfqpoint{1.376cm}{1.588cm}}
\pgfpathlineto{\pgfqpoint{0cm}{1.588cm}}
\pgfpathclose
\pgfusepath{fill}
\end{pgfscope}
\begin{pgfscope}
\pgfsetdash{}{0cm}
\pgfsetlinewidth{0.818mm}
\pgfsetroundcap
\pgfsetroundjoin
\pgfsetmiterlimit{7.0}
\definecolor{eps2pgf_color}{gray}{0}\pgfsetstrokecolor{eps2pgf_color}\pgfsetfillcolor{eps2pgf_color}
\pgfpathmoveto{\pgfqpoint{0.117cm}{1.476cm}}
\pgfpathlineto{\pgfqpoint{0.682cm}{0.726cm}}
\pgfpathlineto{\pgfqpoint{1.246cm}{1.476cm}}
\pgfusepath{stroke}
\end{pgfscope}
\definecolor{eps2pgf_color}{gray}{0}\pgfsetstrokecolor{eps2pgf_color}\pgfsetfillcolor{eps2pgf_color}
\pgfpathmoveto{\pgfqpoint{0.273cm}{1.451cm}}
\pgfpathcurveto{\pgfqpoint{0.273cm}{1.487cm}}{\pgfqpoint{0.259cm}{1.522cm}}{\pgfqpoint{0.233cm}{1.547cm}}
\pgfpathcurveto{\pgfqpoint{0.207cm}{1.573cm}}{\pgfqpoint{0.173cm}{1.588cm}}{\pgfqpoint{0.137cm}{1.588cm}}
\pgfpathcurveto{\pgfqpoint{0.1cm}{1.588cm}}{\pgfqpoint{0.066cm}{1.573cm}}{\pgfqpoint{0.04cm}{1.547cm}}
\pgfpathcurveto{\pgfqpoint{0.014cm}{1.522cm}}{\pgfqpoint{0cm}{1.487cm}}{\pgfqpoint{0cm}{1.451cm}}
\pgfpathcurveto{\pgfqpoint{0cm}{1.414cm}}{\pgfqpoint{0.014cm}{1.379cm}}{\pgfqpoint{0.04cm}{1.354cm}}
\pgfpathcurveto{\pgfqpoint{0.066cm}{1.328cm}}{\pgfqpoint{0.1cm}{1.314cm}}{\pgfqpoint{0.137cm}{1.314cm}}
\pgfpathcurveto{\pgfqpoint{0.173cm}{1.314cm}}{\pgfqpoint{0.207cm}{1.328cm}}{\pgfqpoint{0.233cm}{1.354cm}}
\pgfpathcurveto{\pgfqpoint{0.259cm}{1.379cm}}{\pgfqpoint{0.273cm}{1.414cm}}{\pgfqpoint{0.273cm}{1.451cm}}
\pgfusepath{fill}
\pgfpathmoveto{\pgfqpoint{1.345cm}{1.426cm}}
\pgfpathcurveto{\pgfqpoint{1.345cm}{1.463cm}}{\pgfqpoint{1.331cm}{1.497cm}}{\pgfqpoint{1.305cm}{1.523cm}}
\pgfpathcurveto{\pgfqpoint{1.28cm}{1.549cm}}{\pgfqpoint{1.245cm}{1.563cm}}{\pgfqpoint{1.209cm}{1.563cm}}
\pgfpathcurveto{\pgfqpoint{1.172cm}{1.563cm}}{\pgfqpoint{1.138cm}{1.549cm}}{\pgfqpoint{1.112cm}{1.523cm}}
\pgfpathcurveto{\pgfqpoint{1.087cm}{1.497cm}}{\pgfqpoint{1.072cm}{1.463cm}}{\pgfqpoint{1.072cm}{1.426cm}}
\pgfpathcurveto{\pgfqpoint{1.072cm}{1.39cm}}{\pgfqpoint{1.087cm}{1.355cm}}{\pgfqpoint{1.112cm}{1.329cm}}
\pgfpathcurveto{\pgfqpoint{1.138cm}{1.304cm}}{\pgfqpoint{1.172cm}{1.289cm}}{\pgfqpoint{1.209cm}{1.289cm}}
\pgfpathcurveto{\pgfqpoint{1.245cm}{1.289cm}}{\pgfqpoint{1.28cm}{1.304cm}}{\pgfqpoint{1.305cm}{1.329cm}}
\pgfpathcurveto{\pgfqpoint{1.331cm}{1.355cm}}{\pgfqpoint{1.345cm}{1.39cm}}{\pgfqpoint{1.345cm}{1.426cm}}
\pgfusepath{fill}
\begin{pgfscope}
\pgfsetdash{}{0cm}
\pgfsetlinewidth{0.818mm}
\pgfsetroundcap
\pgfsetmiterlimit{4.0}
\pgfpathmoveto{\pgfqpoint{0.682cm}{0.726cm}}
\pgfpathlineto{\pgfqpoint{0.682cm}{0.097cm}}
\pgfusepath{stroke}
\end{pgfscope}
\end{pgfscope}
\end{pgfscope}
\end{pgfscope}
\end{tikzpicture}}}, X^{\!\resizebox{!}{.8em}{
\begin{tikzpicture}
\pgfpathmoveto{\pgfqpoint{0cm}{-0.035cm}}
\pgfpathlineto{\pgfqpoint{1.976cm}{-0.035cm}}
\pgfpathlineto{\pgfqpoint{1.976cm}{1.94cm}}
\pgfpathlineto{\pgfqpoint{0cm}{1.94cm}}
\pgfpathclose
\pgfusepath{clip}
\begin{pgfscope}
\begin{pgfscope}
\pgfpathmoveto{\pgfqpoint{0cm}{-0.035cm}}
\pgfpathlineto{\pgfqpoint{1.976cm}{-0.035cm}}
\pgfpathlineto{\pgfqpoint{1.976cm}{1.94cm}}
\pgfpathlineto{\pgfqpoint{0cm}{1.94cm}}
\pgfpathclose
\pgfusepath{clip}
\begin{pgfscope}
\begin{pgfscope}
\pgfsetdash{}{0cm}
\pgfsetlinewidth{0.818mm}
\pgfsetroundcap
\pgfsetroundjoin
\pgfsetmiterlimit{7.0}
\definecolor{eps2pgf_color}{gray}{0}\pgfsetstrokecolor{eps2pgf_color}\pgfsetfillcolor{eps2pgf_color}
\pgfpathmoveto{\pgfqpoint{0.117cm}{1.815cm}}
\pgfpathlineto{\pgfqpoint{0.682cm}{1.065cm}}
\pgfpathlineto{\pgfqpoint{1.246cm}{1.815cm}}
\pgfusepath{stroke}
\end{pgfscope}
\definecolor{eps2pgf_color}{gray}{0}\pgfsetstrokecolor{eps2pgf_color}\pgfsetfillcolor{eps2pgf_color}
\pgfpathmoveto{\pgfqpoint{0.273cm}{1.789cm}}
\pgfpathcurveto{\pgfqpoint{0.273cm}{1.825cm}}{\pgfqpoint{0.259cm}{1.86cm}}{\pgfqpoint{0.233cm}{1.886cm}}
\pgfpathcurveto{\pgfqpoint{0.207cm}{1.912cm}}{\pgfqpoint{0.173cm}{1.926cm}}{\pgfqpoint{0.137cm}{1.926cm}}
\pgfpathcurveto{\pgfqpoint{0.1cm}{1.926cm}}{\pgfqpoint{0.066cm}{1.912cm}}{\pgfqpoint{0.04cm}{1.886cm}}
\pgfpathcurveto{\pgfqpoint{0.014cm}{1.86cm}}{\pgfqpoint{0cm}{1.825cm}}{\pgfqpoint{0cm}{1.789cm}}
\pgfpathcurveto{\pgfqpoint{0cm}{1.753cm}}{\pgfqpoint{0.014cm}{1.718cm}}{\pgfqpoint{0.04cm}{1.692cm}}
\pgfpathcurveto{\pgfqpoint{0.066cm}{1.667cm}}{\pgfqpoint{0.1cm}{1.652cm}}{\pgfqpoint{0.137cm}{1.652cm}}
\pgfpathcurveto{\pgfqpoint{0.173cm}{1.652cm}}{\pgfqpoint{0.207cm}{1.667cm}}{\pgfqpoint{0.233cm}{1.692cm}}
\pgfpathcurveto{\pgfqpoint{0.259cm}{1.718cm}}{\pgfqpoint{0.273cm}{1.753cm}}{\pgfqpoint{0.273cm}{1.789cm}}
\pgfusepath{fill}
\begin{pgfscope}
\pgfsetdash{}{0cm}
\pgfsetlinewidth{0.818mm}
\pgfsetmiterlimit{7.0}
\pgfpathmoveto{\pgfqpoint{0.682cm}{1.065cm}}
\pgfpathlineto{\pgfqpoint{0.679cm}{1.812cm}}
\pgfusepath{stroke}
\end{pgfscope}
\pgfpathmoveto{\pgfqpoint{0.815cm}{1.793cm}}
\pgfpathcurveto{\pgfqpoint{0.815cm}{1.829cm}}{\pgfqpoint{0.801cm}{1.864cm}}{\pgfqpoint{0.775cm}{1.89cm}}
\pgfpathcurveto{\pgfqpoint{0.75cm}{1.915cm}}{\pgfqpoint{0.715cm}{1.93cm}}{\pgfqpoint{0.679cm}{1.93cm}}
\pgfpathcurveto{\pgfqpoint{0.643cm}{1.93cm}}{\pgfqpoint{0.608cm}{1.915cm}}{\pgfqpoint{0.582cm}{1.89cm}}
\pgfpathcurveto{\pgfqpoint{0.557cm}{1.864cm}}{\pgfqpoint{0.542cm}{1.829cm}}{\pgfqpoint{0.542cm}{1.793cm}}
\pgfpathcurveto{\pgfqpoint{0.542cm}{1.756cm}}{\pgfqpoint{0.557cm}{1.722cm}}{\pgfqpoint{0.582cm}{1.696cm}}
\pgfpathcurveto{\pgfqpoint{0.608cm}{1.67cm}}{\pgfqpoint{0.643cm}{1.656cm}}{\pgfqpoint{0.679cm}{1.656cm}}
\pgfpathcurveto{\pgfqpoint{0.715cm}{1.656cm}}{\pgfqpoint{0.75cm}{1.67cm}}{\pgfqpoint{0.775cm}{1.696cm}}
\pgfpathcurveto{\pgfqpoint{0.801cm}{1.722cm}}{\pgfqpoint{0.815cm}{1.756cm}}{\pgfqpoint{0.815cm}{1.793cm}}
\pgfusepath{fill}
\pgfpathmoveto{\pgfqpoint{1.345cm}{1.765cm}}
\pgfpathcurveto{\pgfqpoint{1.345cm}{1.801cm}}{\pgfqpoint{1.331cm}{1.836cm}}{\pgfqpoint{1.305cm}{1.862cm}}
\pgfpathcurveto{\pgfqpoint{1.28cm}{1.887cm}}{\pgfqpoint{1.245cm}{1.902cm}}{\pgfqpoint{1.209cm}{1.902cm}}
\pgfpathcurveto{\pgfqpoint{1.172cm}{1.902cm}}{\pgfqpoint{1.138cm}{1.887cm}}{\pgfqpoint{1.112cm}{1.862cm}}
\pgfpathcurveto{\pgfqpoint{1.087cm}{1.836cm}}{\pgfqpoint{1.072cm}{1.801cm}}{\pgfqpoint{1.072cm}{1.765cm}}
\pgfpathcurveto{\pgfqpoint{1.072cm}{1.728cm}}{\pgfqpoint{1.087cm}{1.694cm}}{\pgfqpoint{1.112cm}{1.668cm}}
\pgfpathcurveto{\pgfqpoint{1.138cm}{1.642cm}}{\pgfqpoint{1.172cm}{1.628cm}}{\pgfqpoint{1.209cm}{1.628cm}}
\pgfpathcurveto{\pgfqpoint{1.245cm}{1.628cm}}{\pgfqpoint{1.28cm}{1.642cm}}{\pgfqpoint{1.305cm}{1.668cm}}
\pgfpathcurveto{\pgfqpoint{1.331cm}{1.694cm}}{\pgfqpoint{1.345cm}{1.728cm}}{\pgfqpoint{1.345cm}{1.765cm}}
\pgfusepath{fill}
\begin{pgfscope}
\pgfsetdash{}{0cm}
\pgfsetlinewidth{0.818mm}
\pgfsetroundcap
\pgfsetroundjoin
\pgfsetmiterlimit{7.0}
\pgfpathmoveto{\pgfqpoint{0.682cm}{1.065cm}}
\pgfpathlineto{\pgfqpoint{1.246cm}{0.315cm}}
\pgfpathlineto{\pgfqpoint{1.811cm}{1.065cm}}
\pgfusepath{stroke}
\end{pgfscope}
\pgfpathmoveto{\pgfqpoint{1.948cm}{1.065cm}}
\pgfpathcurveto{\pgfqpoint{1.948cm}{1.101cm}}{\pgfqpoint{1.933cm}{1.136cm}}{\pgfqpoint{1.907cm}{1.162cm}}
\pgfpathcurveto{\pgfqpoint{1.882cm}{1.187cm}}{\pgfqpoint{1.847cm}{1.202cm}}{\pgfqpoint{1.811cm}{1.202cm}}
\pgfpathcurveto{\pgfqpoint{1.775cm}{1.202cm}}{\pgfqpoint{1.74cm}{1.187cm}}{\pgfqpoint{1.714cm}{1.162cm}}
\pgfpathcurveto{\pgfqpoint{1.689cm}{1.136cm}}{\pgfqpoint{1.674cm}{1.101cm}}{\pgfqpoint{1.674cm}{1.065cm}}
\pgfpathcurveto{\pgfqpoint{1.674cm}{1.029cm}}{\pgfqpoint{1.689cm}{0.994cm}}{\pgfqpoint{1.714cm}{0.968cm}}
\pgfpathcurveto{\pgfqpoint{1.74cm}{0.942cm}}{\pgfqpoint{1.775cm}{0.928cm}}{\pgfqpoint{1.811cm}{0.928cm}}
\pgfpathcurveto{\pgfqpoint{1.847cm}{0.928cm}}{\pgfqpoint{1.882cm}{0.942cm}}{\pgfqpoint{1.907cm}{0.968cm}}
\pgfpathcurveto{\pgfqpoint{1.933cm}{0.994cm}}{\pgfqpoint{1.948cm}{1.029cm}}{\pgfqpoint{1.948cm}{1.065cm}}
\pgfusepath{fill}
\begin{pgfscope}
\pgfsetdash{}{0cm}
\pgfsetlinewidth{0.818mm}
\pgfsetmiterlimit{4.0}
\pgfpathmoveto{\pgfqpoint{1.383cm}{0.178cm}}
\pgfpathcurveto{\pgfqpoint{1.383cm}{0.214cm}}{\pgfqpoint{1.369cm}{0.249cm}}{\pgfqpoint{1.343cm}{0.275cm}}
\pgfpathcurveto{\pgfqpoint{1.317cm}{0.3cm}}{\pgfqpoint{1.283cm}{0.315cm}}{\pgfqpoint{1.246cm}{0.315cm}}
\pgfpathcurveto{\pgfqpoint{1.21cm}{0.315cm}}{\pgfqpoint{1.175cm}{0.3cm}}{\pgfqpoint{1.15cm}{0.275cm}}
\pgfpathcurveto{\pgfqpoint{1.124cm}{0.249cm}}{\pgfqpoint{1.11cm}{0.214cm}}{\pgfqpoint{1.11cm}{0.178cm}}
\pgfpathcurveto{\pgfqpoint{1.11cm}{0.141cm}}{\pgfqpoint{1.124cm}{0.107cm}}{\pgfqpoint{1.15cm}{0.081cm}}
\pgfpathcurveto{\pgfqpoint{1.175cm}{0.055cm}}{\pgfqpoint{1.21cm}{0.041cm}}{\pgfqpoint{1.246cm}{0.041cm}}
\pgfpathcurveto{\pgfqpoint{1.283cm}{0.041cm}}{\pgfqpoint{1.317cm}{0.055cm}}{\pgfqpoint{1.343cm}{0.081cm}}
\pgfpathcurveto{\pgfqpoint{1.369cm}{0.107cm}}{\pgfqpoint{1.383cm}{0.141cm}}{\pgfqpoint{1.383cm}{0.178cm}}
\pgfusepath{stroke}
\end{pgfscope}
\end{pgfscope}
\end{pgfscope}
\end{pgfscope}
\end{tikzpicture}}}, X^{\!\resizebox{!}{.8em}{
\begin{tikzpicture}
\pgfpathmoveto{\pgfqpoint{0cm}{-0.035cm}}
\pgfpathlineto{\pgfqpoint{1.976cm}{-0.035cm}}
\pgfpathlineto{\pgfqpoint{1.976cm}{1.94cm}}
\pgfpathlineto{\pgfqpoint{0cm}{1.94cm}}
\pgfpathclose
\pgfusepath{clip}
\begin{pgfscope}
\begin{pgfscope}
\pgfpathmoveto{\pgfqpoint{0cm}{-0.035cm}}
\pgfpathlineto{\pgfqpoint{1.976cm}{-0.035cm}}
\pgfpathlineto{\pgfqpoint{1.976cm}{1.94cm}}
\pgfpathlineto{\pgfqpoint{0cm}{1.94cm}}
\pgfpathclose
\pgfusepath{clip}
\begin{pgfscope}
\begin{pgfscope}
\pgfsetdash{}{0cm}
\pgfsetlinewidth{0.818mm}
\pgfsetroundcap
\pgfsetroundjoin
\pgfsetmiterlimit{7.0}
\definecolor{eps2pgf_color}{gray}{0}\pgfsetstrokecolor{eps2pgf_color}\pgfsetfillcolor{eps2pgf_color}
\pgfpathmoveto{\pgfqpoint{0.117cm}{1.815cm}}
\pgfpathlineto{\pgfqpoint{0.682cm}{1.065cm}}
\pgfpathlineto{\pgfqpoint{1.246cm}{1.815cm}}
\pgfusepath{stroke}
\end{pgfscope}
\definecolor{eps2pgf_color}{gray}{0}\pgfsetstrokecolor{eps2pgf_color}\pgfsetfillcolor{eps2pgf_color}
\pgfpathmoveto{\pgfqpoint{0.273cm}{1.789cm}}
\pgfpathcurveto{\pgfqpoint{0.273cm}{1.825cm}}{\pgfqpoint{0.259cm}{1.86cm}}{\pgfqpoint{0.233cm}{1.886cm}}
\pgfpathcurveto{\pgfqpoint{0.207cm}{1.912cm}}{\pgfqpoint{0.173cm}{1.926cm}}{\pgfqpoint{0.137cm}{1.926cm}}
\pgfpathcurveto{\pgfqpoint{0.1cm}{1.926cm}}{\pgfqpoint{0.066cm}{1.912cm}}{\pgfqpoint{0.04cm}{1.886cm}}
\pgfpathcurveto{\pgfqpoint{0.014cm}{1.86cm}}{\pgfqpoint{0cm}{1.825cm}}{\pgfqpoint{0cm}{1.789cm}}
\pgfpathcurveto{\pgfqpoint{0cm}{1.753cm}}{\pgfqpoint{0.014cm}{1.718cm}}{\pgfqpoint{0.04cm}{1.692cm}}
\pgfpathcurveto{\pgfqpoint{0.066cm}{1.667cm}}{\pgfqpoint{0.1cm}{1.652cm}}{\pgfqpoint{0.137cm}{1.652cm}}
\pgfpathcurveto{\pgfqpoint{0.173cm}{1.652cm}}{\pgfqpoint{0.207cm}{1.667cm}}{\pgfqpoint{0.233cm}{1.692cm}}
\pgfpathcurveto{\pgfqpoint{0.259cm}{1.718cm}}{\pgfqpoint{0.273cm}{1.753cm}}{\pgfqpoint{0.273cm}{1.789cm}}
\pgfusepath{fill}
\pgfpathmoveto{\pgfqpoint{1.345cm}{1.765cm}}
\pgfpathcurveto{\pgfqpoint{1.345cm}{1.801cm}}{\pgfqpoint{1.331cm}{1.836cm}}{\pgfqpoint{1.305cm}{1.862cm}}
\pgfpathcurveto{\pgfqpoint{1.28cm}{1.887cm}}{\pgfqpoint{1.245cm}{1.902cm}}{\pgfqpoint{1.209cm}{1.902cm}}
\pgfpathcurveto{\pgfqpoint{1.172cm}{1.902cm}}{\pgfqpoint{1.138cm}{1.887cm}}{\pgfqpoint{1.112cm}{1.862cm}}
\pgfpathcurveto{\pgfqpoint{1.087cm}{1.836cm}}{\pgfqpoint{1.072cm}{1.801cm}}{\pgfqpoint{1.072cm}{1.765cm}}
\pgfpathcurveto{\pgfqpoint{1.072cm}{1.728cm}}{\pgfqpoint{1.087cm}{1.694cm}}{\pgfqpoint{1.112cm}{1.668cm}}
\pgfpathcurveto{\pgfqpoint{1.138cm}{1.642cm}}{\pgfqpoint{1.172cm}{1.628cm}}{\pgfqpoint{1.209cm}{1.628cm}}
\pgfpathcurveto{\pgfqpoint{1.245cm}{1.628cm}}{\pgfqpoint{1.28cm}{1.642cm}}{\pgfqpoint{1.305cm}{1.668cm}}
\pgfpathcurveto{\pgfqpoint{1.331cm}{1.694cm}}{\pgfqpoint{1.345cm}{1.728cm}}{\pgfqpoint{1.345cm}{1.765cm}}
\pgfusepath{fill}
\begin{pgfscope}
\pgfsetdash{}{0cm}
\pgfsetlinewidth{0.818mm}
\pgfsetroundcap
\pgfsetroundjoin
\pgfsetmiterlimit{7.0}
\pgfpathmoveto{\pgfqpoint{0.682cm}{1.065cm}}
\pgfpathlineto{\pgfqpoint{1.246cm}{0.315cm}}
\pgfpathlineto{\pgfqpoint{1.811cm}{1.065cm}}
\pgfusepath{stroke}
\end{pgfscope}
\pgfpathmoveto{\pgfqpoint{1.948cm}{1.065cm}}
\pgfpathcurveto{\pgfqpoint{1.948cm}{1.101cm}}{\pgfqpoint{1.933cm}{1.136cm}}{\pgfqpoint{1.907cm}{1.162cm}}
\pgfpathcurveto{\pgfqpoint{1.882cm}{1.187cm}}{\pgfqpoint{1.847cm}{1.202cm}}{\pgfqpoint{1.811cm}{1.202cm}}
\pgfpathcurveto{\pgfqpoint{1.775cm}{1.202cm}}{\pgfqpoint{1.74cm}{1.187cm}}{\pgfqpoint{1.714cm}{1.162cm}}
\pgfpathcurveto{\pgfqpoint{1.689cm}{1.136cm}}{\pgfqpoint{1.674cm}{1.101cm}}{\pgfqpoint{1.674cm}{1.065cm}}
\pgfpathcurveto{\pgfqpoint{1.674cm}{1.029cm}}{\pgfqpoint{1.689cm}{0.994cm}}{\pgfqpoint{1.714cm}{0.968cm}}
\pgfpathcurveto{\pgfqpoint{1.74cm}{0.942cm}}{\pgfqpoint{1.775cm}{0.928cm}}{\pgfqpoint{1.811cm}{0.928cm}}
\pgfpathcurveto{\pgfqpoint{1.847cm}{0.928cm}}{\pgfqpoint{1.882cm}{0.942cm}}{\pgfqpoint{1.907cm}{0.968cm}}
\pgfpathcurveto{\pgfqpoint{1.933cm}{0.994cm}}{\pgfqpoint{1.948cm}{1.029cm}}{\pgfqpoint{1.948cm}{1.065cm}}
\pgfusepath{fill}
\begin{pgfscope}
\pgfsetdash{}{0cm}
\pgfsetlinewidth{0.818mm}
\pgfsetmiterlimit{7.0}
\pgfpathmoveto{\pgfqpoint{1.246cm}{0.315cm}}
\pgfpathlineto{\pgfqpoint{1.244cm}{1.061cm}}
\pgfusepath{stroke}
\end{pgfscope}
\pgfpathmoveto{\pgfqpoint{1.38cm}{1.065cm}}
\pgfpathcurveto{\pgfqpoint{1.38cm}{1.101cm}}{\pgfqpoint{1.366cm}{1.136cm}}{\pgfqpoint{1.34cm}{1.162cm}}
\pgfpathcurveto{\pgfqpoint{1.315cm}{1.187cm}}{\pgfqpoint{1.28cm}{1.202cm}}{\pgfqpoint{1.244cm}{1.202cm}}
\pgfpathcurveto{\pgfqpoint{1.207cm}{1.202cm}}{\pgfqpoint{1.173cm}{1.187cm}}{\pgfqpoint{1.147cm}{1.162cm}}
\pgfpathcurveto{\pgfqpoint{1.121cm}{1.136cm}}{\pgfqpoint{1.107cm}{1.101cm}}{\pgfqpoint{1.107cm}{1.065cm}}
\pgfpathcurveto{\pgfqpoint{1.107cm}{1.029cm}}{\pgfqpoint{1.121cm}{0.994cm}}{\pgfqpoint{1.147cm}{0.968cm}}
\pgfpathcurveto{\pgfqpoint{1.173cm}{0.942cm}}{\pgfqpoint{1.207cm}{0.928cm}}{\pgfqpoint{1.244cm}{0.928cm}}
\pgfpathcurveto{\pgfqpoint{1.28cm}{0.928cm}}{\pgfqpoint{1.315cm}{0.942cm}}{\pgfqpoint{1.34cm}{0.968cm}}
\pgfpathcurveto{\pgfqpoint{1.366cm}{0.994cm}}{\pgfqpoint{1.38cm}{1.029cm}}{\pgfqpoint{1.38cm}{1.065cm}}
\pgfusepath{fill}
\begin{pgfscope}
\pgfsetdash{}{0cm}
\pgfsetlinewidth{0.818mm}
\pgfsetmiterlimit{4.0}
\pgfpathmoveto{\pgfqpoint{1.383cm}{0.178cm}}
\pgfpathcurveto{\pgfqpoint{1.383cm}{0.214cm}}{\pgfqpoint{1.369cm}{0.249cm}}{\pgfqpoint{1.343cm}{0.275cm}}
\pgfpathcurveto{\pgfqpoint{1.317cm}{0.3cm}}{\pgfqpoint{1.283cm}{0.315cm}}{\pgfqpoint{1.246cm}{0.315cm}}
\pgfpathcurveto{\pgfqpoint{1.21cm}{0.315cm}}{\pgfqpoint{1.175cm}{0.3cm}}{\pgfqpoint{1.15cm}{0.275cm}}
\pgfpathcurveto{\pgfqpoint{1.124cm}{0.249cm}}{\pgfqpoint{1.11cm}{0.214cm}}{\pgfqpoint{1.11cm}{0.178cm}}
\pgfpathcurveto{\pgfqpoint{1.11cm}{0.141cm}}{\pgfqpoint{1.124cm}{0.107cm}}{\pgfqpoint{1.15cm}{0.081cm}}
\pgfpathcurveto{\pgfqpoint{1.175cm}{0.055cm}}{\pgfqpoint{1.21cm}{0.041cm}}{\pgfqpoint{1.246cm}{0.041cm}}
\pgfpathcurveto{\pgfqpoint{1.283cm}{0.041cm}}{\pgfqpoint{1.317cm}{0.055cm}}{\pgfqpoint{1.343cm}{0.081cm}}
\pgfpathcurveto{\pgfqpoint{1.369cm}{0.107cm}}{\pgfqpoint{1.383cm}{0.141cm}}{\pgfqpoint{1.383cm}{0.178cm}}
\pgfusepath{stroke}
\end{pgfscope}
\end{pgfscope}
\end{pgfscope}
\end{pgfscope}
\end{tikzpicture}}}, X^{\!\resizebox{!}{.8em}{
\begin{tikzpicture}
\pgfpathmoveto{\pgfqpoint{0cm}{-0.035cm}}
\pgfpathlineto{\pgfqpoint{1.976cm}{-0.035cm}}
\pgfpathlineto{\pgfqpoint{1.976cm}{1.94cm}}
\pgfpathlineto{\pgfqpoint{0cm}{1.94cm}}
\pgfpathclose
\pgfusepath{clip}
\begin{pgfscope}
\begin{pgfscope}
\pgfpathmoveto{\pgfqpoint{0cm}{-0.035cm}}
\pgfpathlineto{\pgfqpoint{1.976cm}{-0.035cm}}
\pgfpathlineto{\pgfqpoint{1.976cm}{1.94cm}}
\pgfpathlineto{\pgfqpoint{0cm}{1.94cm}}
\pgfpathclose
\pgfusepath{clip}
\begin{pgfscope}
\begin{pgfscope}
\pgfsetdash{}{0cm}
\pgfsetlinewidth{0.818mm}
\pgfsetroundcap
\pgfsetroundjoin
\pgfsetmiterlimit{7.0}
\definecolor{eps2pgf_color}{gray}{0}\pgfsetstrokecolor{eps2pgf_color}\pgfsetfillcolor{eps2pgf_color}
\pgfpathmoveto{\pgfqpoint{0.117cm}{1.815cm}}
\pgfpathlineto{\pgfqpoint{0.682cm}{1.065cm}}
\pgfpathlineto{\pgfqpoint{1.246cm}{1.815cm}}
\pgfusepath{stroke}
\end{pgfscope}
\definecolor{eps2pgf_color}{gray}{0}\pgfsetstrokecolor{eps2pgf_color}\pgfsetfillcolor{eps2pgf_color}
\pgfpathmoveto{\pgfqpoint{0.273cm}{1.789cm}}
\pgfpathcurveto{\pgfqpoint{0.273cm}{1.825cm}}{\pgfqpoint{0.259cm}{1.86cm}}{\pgfqpoint{0.233cm}{1.886cm}}
\pgfpathcurveto{\pgfqpoint{0.207cm}{1.912cm}}{\pgfqpoint{0.173cm}{1.926cm}}{\pgfqpoint{0.137cm}{1.926cm}}
\pgfpathcurveto{\pgfqpoint{0.1cm}{1.926cm}}{\pgfqpoint{0.066cm}{1.912cm}}{\pgfqpoint{0.04cm}{1.886cm}}
\pgfpathcurveto{\pgfqpoint{0.014cm}{1.86cm}}{\pgfqpoint{0cm}{1.825cm}}{\pgfqpoint{0cm}{1.789cm}}
\pgfpathcurveto{\pgfqpoint{0cm}{1.753cm}}{\pgfqpoint{0.014cm}{1.718cm}}{\pgfqpoint{0.04cm}{1.692cm}}
\pgfpathcurveto{\pgfqpoint{0.066cm}{1.667cm}}{\pgfqpoint{0.1cm}{1.652cm}}{\pgfqpoint{0.137cm}{1.652cm}}
\pgfpathcurveto{\pgfqpoint{0.173cm}{1.652cm}}{\pgfqpoint{0.207cm}{1.667cm}}{\pgfqpoint{0.233cm}{1.692cm}}
\pgfpathcurveto{\pgfqpoint{0.259cm}{1.718cm}}{\pgfqpoint{0.273cm}{1.753cm}}{\pgfqpoint{0.273cm}{1.789cm}}
\pgfusepath{fill}
\begin{pgfscope}
\pgfsetdash{}{0cm}
\pgfsetlinewidth{0.818mm}
\pgfsetmiterlimit{7.0}
\pgfpathmoveto{\pgfqpoint{0.682cm}{1.065cm}}
\pgfpathlineto{\pgfqpoint{0.679cm}{1.812cm}}
\pgfusepath{stroke}
\end{pgfscope}
\pgfpathmoveto{\pgfqpoint{0.815cm}{1.793cm}}
\pgfpathcurveto{\pgfqpoint{0.815cm}{1.829cm}}{\pgfqpoint{0.801cm}{1.864cm}}{\pgfqpoint{0.775cm}{1.89cm}}
\pgfpathcurveto{\pgfqpoint{0.75cm}{1.915cm}}{\pgfqpoint{0.715cm}{1.93cm}}{\pgfqpoint{0.679cm}{1.93cm}}
\pgfpathcurveto{\pgfqpoint{0.643cm}{1.93cm}}{\pgfqpoint{0.608cm}{1.915cm}}{\pgfqpoint{0.582cm}{1.89cm}}
\pgfpathcurveto{\pgfqpoint{0.557cm}{1.864cm}}{\pgfqpoint{0.542cm}{1.829cm}}{\pgfqpoint{0.542cm}{1.793cm}}
\pgfpathcurveto{\pgfqpoint{0.542cm}{1.756cm}}{\pgfqpoint{0.557cm}{1.722cm}}{\pgfqpoint{0.582cm}{1.696cm}}
\pgfpathcurveto{\pgfqpoint{0.608cm}{1.67cm}}{\pgfqpoint{0.643cm}{1.656cm}}{\pgfqpoint{0.679cm}{1.656cm}}
\pgfpathcurveto{\pgfqpoint{0.715cm}{1.656cm}}{\pgfqpoint{0.75cm}{1.67cm}}{\pgfqpoint{0.775cm}{1.696cm}}
\pgfpathcurveto{\pgfqpoint{0.801cm}{1.722cm}}{\pgfqpoint{0.815cm}{1.756cm}}{\pgfqpoint{0.815cm}{1.793cm}}
\pgfusepath{fill}
\pgfpathmoveto{\pgfqpoint{1.345cm}{1.765cm}}
\pgfpathcurveto{\pgfqpoint{1.345cm}{1.801cm}}{\pgfqpoint{1.331cm}{1.836cm}}{\pgfqpoint{1.305cm}{1.862cm}}
\pgfpathcurveto{\pgfqpoint{1.28cm}{1.887cm}}{\pgfqpoint{1.245cm}{1.902cm}}{\pgfqpoint{1.209cm}{1.902cm}}
\pgfpathcurveto{\pgfqpoint{1.172cm}{1.902cm}}{\pgfqpoint{1.138cm}{1.887cm}}{\pgfqpoint{1.112cm}{1.862cm}}
\pgfpathcurveto{\pgfqpoint{1.087cm}{1.836cm}}{\pgfqpoint{1.072cm}{1.801cm}}{\pgfqpoint{1.072cm}{1.765cm}}
\pgfpathcurveto{\pgfqpoint{1.072cm}{1.728cm}}{\pgfqpoint{1.087cm}{1.694cm}}{\pgfqpoint{1.112cm}{1.668cm}}
\pgfpathcurveto{\pgfqpoint{1.138cm}{1.642cm}}{\pgfqpoint{1.172cm}{1.628cm}}{\pgfqpoint{1.209cm}{1.628cm}}
\pgfpathcurveto{\pgfqpoint{1.245cm}{1.628cm}}{\pgfqpoint{1.28cm}{1.642cm}}{\pgfqpoint{1.305cm}{1.668cm}}
\pgfpathcurveto{\pgfqpoint{1.331cm}{1.694cm}}{\pgfqpoint{1.345cm}{1.728cm}}{\pgfqpoint{1.345cm}{1.765cm}}
\pgfusepath{fill}
\begin{pgfscope}
\pgfsetdash{}{0cm}
\pgfsetlinewidth{0.818mm}
\pgfsetroundcap
\pgfsetroundjoin
\pgfsetmiterlimit{7.0}
\pgfpathmoveto{\pgfqpoint{0.682cm}{1.065cm}}
\pgfpathlineto{\pgfqpoint{1.246cm}{0.315cm}}
\pgfpathlineto{\pgfqpoint{1.811cm}{1.065cm}}
\pgfusepath{stroke}
\end{pgfscope}
\pgfpathmoveto{\pgfqpoint{1.948cm}{1.065cm}}
\pgfpathcurveto{\pgfqpoint{1.948cm}{1.101cm}}{\pgfqpoint{1.933cm}{1.136cm}}{\pgfqpoint{1.907cm}{1.162cm}}
\pgfpathcurveto{\pgfqpoint{1.882cm}{1.187cm}}{\pgfqpoint{1.847cm}{1.202cm}}{\pgfqpoint{1.811cm}{1.202cm}}
\pgfpathcurveto{\pgfqpoint{1.775cm}{1.202cm}}{\pgfqpoint{1.74cm}{1.187cm}}{\pgfqpoint{1.714cm}{1.162cm}}
\pgfpathcurveto{\pgfqpoint{1.689cm}{1.136cm}}{\pgfqpoint{1.674cm}{1.101cm}}{\pgfqpoint{1.674cm}{1.065cm}}
\pgfpathcurveto{\pgfqpoint{1.674cm}{1.029cm}}{\pgfqpoint{1.689cm}{0.994cm}}{\pgfqpoint{1.714cm}{0.968cm}}
\pgfpathcurveto{\pgfqpoint{1.74cm}{0.942cm}}{\pgfqpoint{1.775cm}{0.928cm}}{\pgfqpoint{1.811cm}{0.928cm}}
\pgfpathcurveto{\pgfqpoint{1.847cm}{0.928cm}}{\pgfqpoint{1.882cm}{0.942cm}}{\pgfqpoint{1.907cm}{0.968cm}}
\pgfpathcurveto{\pgfqpoint{1.933cm}{0.994cm}}{\pgfqpoint{1.948cm}{1.029cm}}{\pgfqpoint{1.948cm}{1.065cm}}
\pgfusepath{fill}
\begin{pgfscope}
\pgfsetdash{}{0cm}
\pgfsetlinewidth{0.818mm}
\pgfsetmiterlimit{7.0}
\pgfpathmoveto{\pgfqpoint{1.246cm}{0.315cm}}
\pgfpathlineto{\pgfqpoint{1.244cm}{1.061cm}}
\pgfusepath{stroke}
\end{pgfscope}
\pgfpathmoveto{\pgfqpoint{1.38cm}{1.065cm}}
\pgfpathcurveto{\pgfqpoint{1.38cm}{1.101cm}}{\pgfqpoint{1.366cm}{1.136cm}}{\pgfqpoint{1.34cm}{1.162cm}}
\pgfpathcurveto{\pgfqpoint{1.315cm}{1.187cm}}{\pgfqpoint{1.28cm}{1.202cm}}{\pgfqpoint{1.244cm}{1.202cm}}
\pgfpathcurveto{\pgfqpoint{1.207cm}{1.202cm}}{\pgfqpoint{1.173cm}{1.187cm}}{\pgfqpoint{1.147cm}{1.162cm}}
\pgfpathcurveto{\pgfqpoint{1.121cm}{1.136cm}}{\pgfqpoint{1.107cm}{1.101cm}}{\pgfqpoint{1.107cm}{1.065cm}}
\pgfpathcurveto{\pgfqpoint{1.107cm}{1.029cm}}{\pgfqpoint{1.121cm}{0.994cm}}{\pgfqpoint{1.147cm}{0.968cm}}
\pgfpathcurveto{\pgfqpoint{1.173cm}{0.942cm}}{\pgfqpoint{1.207cm}{0.928cm}}{\pgfqpoint{1.244cm}{0.928cm}}
\pgfpathcurveto{\pgfqpoint{1.28cm}{0.928cm}}{\pgfqpoint{1.315cm}{0.942cm}}{\pgfqpoint{1.34cm}{0.968cm}}
\pgfpathcurveto{\pgfqpoint{1.366cm}{0.994cm}}{\pgfqpoint{1.38cm}{1.029cm}}{\pgfqpoint{1.38cm}{1.065cm}}
\pgfusepath{fill}
\begin{pgfscope}
\pgfsetdash{}{0cm}
\pgfsetlinewidth{0.818mm}
\pgfsetmiterlimit{4.0}
\pgfpathmoveto{\pgfqpoint{1.383cm}{0.178cm}}
\pgfpathcurveto{\pgfqpoint{1.383cm}{0.214cm}}{\pgfqpoint{1.369cm}{0.249cm}}{\pgfqpoint{1.343cm}{0.275cm}}
\pgfpathcurveto{\pgfqpoint{1.317cm}{0.3cm}}{\pgfqpoint{1.283cm}{0.315cm}}{\pgfqpoint{1.246cm}{0.315cm}}
\pgfpathcurveto{\pgfqpoint{1.21cm}{0.315cm}}{\pgfqpoint{1.175cm}{0.3cm}}{\pgfqpoint{1.15cm}{0.275cm}}
\pgfpathcurveto{\pgfqpoint{1.124cm}{0.249cm}}{\pgfqpoint{1.11cm}{0.214cm}}{\pgfqpoint{1.11cm}{0.178cm}}
\pgfpathcurveto{\pgfqpoint{1.11cm}{0.141cm}}{\pgfqpoint{1.124cm}{0.107cm}}{\pgfqpoint{1.15cm}{0.081cm}}
\pgfpathcurveto{\pgfqpoint{1.175cm}{0.055cm}}{\pgfqpoint{1.21cm}{0.041cm}}{\pgfqpoint{1.246cm}{0.041cm}}
\pgfpathcurveto{\pgfqpoint{1.283cm}{0.041cm}}{\pgfqpoint{1.317cm}{0.055cm}}{\pgfqpoint{1.343cm}{0.081cm}}
\pgfpathcurveto{\pgfqpoint{1.369cm}{0.107cm}}{\pgfqpoint{1.383cm}{0.141cm}}{\pgfqpoint{1.383cm}{0.178cm}}
\pgfusepath{stroke}
\end{pgfscope}
\end{pgfscope}
\end{pgfscope}
\end{pgfscope}
\end{tikzpicture}}}
\end{equation}
such that if $\tau$ denotes one of the distributions in \eqref{eq:r43} then $$\tau\in C\CC^{\alpha_{\tau}}(\rho^{\sigma})\cap C^{\delta/2}\CC^{\alpha_{\tau}-\gamma}(\rho^{\sigma})  $$ for $\alpha_{\tau}$  given by Table \ref{t:reg}, every $\kappa,\sigma>0$ and some $\delta,\gamma>0$. Moreover, if $\tau_{\varepsilon}$ is the smooth version of  $\tau$ then
$\tau_{\varepsilon }\to \tau$ in $ C\CC^{\alpha_{\tau}}(\rho^{\sigma})\cap C^{\delta/2}\CC^{\alpha_{\tau}-\gamma}(\rho^{\sigma})$ a.s. as $\varepsilon\to0$.
 \end{theorem}

 \begin{proof} The convergence and renormalization of the stochastic terms has been performed several times in the literature, see the proof of Theorem~\ref{thm:renorm2} for precise references. As for the convergence in the space-time weighted Besov--H\"older spaces arguments similar to those described in Theorem~\ref{thm:renorm42} can be applied to establish the claim.
 \end{proof}
 
\rmbb{\begin{remark}
We note that $\llbracket X^{3}\rrbracket$ can be only realized as a space-time random distribution and point evaluation for fixed times is not well defined. Thus, $\llbracket X^{3}\rrbracket$ was not included in the statement of Theorem \ref{thm:renorm43}. However, it is not needed in the subsequent analysis of the  $d=3$ parabolic case.
 \end{remark}}

\section{Elliptic $\Phi^4_4$ model}
\label{sec:44}

The goal of this section is  threefold. First, we derive a suitable decomposition of the elliptic $\Phi^{4}$ model \eqref{eq:phi4} in dimension 4. Second, we establish a priori estimates for the involved quantities. This will also serve as a basis for the investigation of the parabolic $\Phi^{4}$ model in dimension 2, see Section \ref{sec:42}. Finally, we employ Schaefer's fixed point theorem together with compactness arguments in order to construct solutions to the decomposed elliptic system.

\subsection{Decomposition into simpler equations}
\label{ssec:decomp}

We  study the elliptic equation
\begin{equation*}
(- \Delta + \mu) \varphi + \varphi^3 - 3 a \varphi - \xi = 0
\end{equation*}
in $\mathbbm{R}^4$ where $\xi$ is a space white noise and $a$ stands for a renormalization constant needed to define the stochastic objects below. We let $(- \Delta +
\mu) = \Q$ and introduce the ansatz
\[ \varphi = X + \phi + \psi, \]
with
\begin{equation*}
 \Q X = \xi, \qquad \llbracket X^3 \rrbracket \assign X^3 - 3 a X, \quad
   \llbracket X^2 \rrbracket \assign X^2 - a.
\end{equation*}
Consequently,
\[ 0 = \Q \varphi + \varphi^3 - 3 a \varphi - \xi = \Q \phi + \Q \psi +
   \llbracket X^3 \rrbracket + 3 (\phi + \psi) \llbracket X^2 \rrbracket + 3
   (\phi + \psi)^2 X + (\phi + \psi)^3 . \]
This equation will be decomposed into a system of equations, namely,
\begin{equation}\label{eq:44b}
\Q \phi + \Phi = 0, \qquad \Q \psi + \psi^3 + \Psi = 0,
\end{equation}
where in $\Phi$ we collect all the contributions of negative regularity and in
$\Psi$ all the others (belonging locally to $L^{\infty}$). In addition, with the help of  the operators $\UU_{\leqslant},
\UU_{>}$ defined in Section~\ref{ssec:local}, we localize all the irregular contributions. Namely, each irregular term depending on $\phi+\psi$ will be decomposed into two parts: one even more irregular but controlled by the $L^{\8}$-norm of $\phi+\psi$; and its regular counterpart, which will be included into $\Psi$. This step will be beneficial for the a priori estimates in Section \ref{ssec:apr} as it allows to estimate $\phi$ easily and therefore  eliminate various norms of $\phi$ from the estimates of $\psi$. In other words, thanks to the localizers $\UU_{\leqslant}, \UU_{>}$ we are able to decouple \eqref{eq:44b} and develop an efficient approach towards a priori estimates.

\rmbb{To be more precise,  recall that the operators $\UU_{\leqslant}, \UU_{>}$ depend on a given parameter $L>0$, which has to be chosen appropriately. Moreover, we will choose different values of $L$ for different stochastic objects while keeping in mind that $\UU_{>}$ and $\UU_{\leq}$ of one object shall be given by the same parameter $L$ in order to maintain $\UU_{>}
+\UU_{\leqslant} = \mathrm{Id}$. For the moment, we keep these parameters fixed but arbitrary and their precise values will be determined  below in Section \ref{ssec:apr}.}

Including the localizers, we define
\begin{align}\label{eq:44c}
\begin{aligned}
\Phi &:= \llbracket X^3 \rrbracket + 3 (\phi + \psi) \prec \UU_{>}
   \llbracket X^2 \rrbracket + 3 (\phi + \psi)^2 \prec \UU_{>} X,\\
    \Psi &:= \Psi_1 + \Psi_2, \\
     \Psi_1 &:= \phi^3 + 3 \psi \phi^2 + 3 \psi^2 \phi,\\
     \Psi_2 &:= 3 (\phi + \psi) \prec \UU_{\leqslant} \llbracket X^2
   \rrbracket + 3 (\phi + \psi)^2 \prec \UU_{\leqslant} X + 3 (\phi +
   \psi) \succcurlyeq \llbracket X^2 \rrbracket + 3 (\phi + \psi)^2
   \succcurlyeq X.
\end{aligned}
\end{align}   
 
\subsection{A priori estimates}
\label{ssec:apr}
 
 \rmbb{Let us fix a constant $K>0$ to be chosen  after \eqref{eq:44e} based on the $L^{\infty}$-norm of $\phi+\psi$. Given this value of $K$, we  now determine the values of $L$ in the localization of $X$ and $\llbracket X^{2}\rrbracket$ appearing in \eqref{eq:44c}.}
To this end, recall that the stochastic objects can be constructed so that
$$
\|X\|_{\CC^{-\kappa}(\rho^\sigma)},\|\llbracket X^2 \rrbracket\|_{\CC^{-\kappa}(\rho^\sigma)},\|\llbracket X^3 \rrbracket\|_{\CC^{-\kappa}(\rho^\sigma)}\lesssim 1
$$
provided $\rho$ is a polynomial weight of the form $\rho(x)=\langle x\rangle^{-\nu}$ for some $\nu>0$ and $\kappa,\sigma>0$. 
Hence in view of Lemma~\ref{lem:local} we can choose small parameters $\alpha>\kappa>0$ and $\delta=2-\kappa-\alpha>0$, $\beta=\alpha-\kappa>0$ to set up the localization operators so that (in the sequel, the parameter $\sigma$ is always positive but may change from bound to bound)
$$
\| \UU_{>} X \|_{\CC^{\alpha-2} (\rho^{- 1})} \lesssim 2^{- \delta K}\|X\|_{\CC^{-\kappa}(\rho^\sigma)},
   \qquad \| \UU_{\leqslant} X \|_{\CC^{\alpha} (\rho^{1})}
   \lesssim 2^{(\alpha+\kappa) K}\|X\|_{\CC^{-\kappa}(\rho^\sigma)},
$$
and
\[ \| \UU_{>} \llbracket X^2 \rrbracket \|_{\CC^{\alpha-2}} \lesssim
   2^{- \delta K/2}\|\llbracket X^2 \rrbracket\|_{\CC^{-\kappa}(\rho^\sigma)}, \qquad \| \UU_{\leqslant} \llbracket X^2 \rrbracket
   \|_{\CC^{\alpha} (\rho^{2 })} \lesssim 2^{(\alpha+\kappa) K/2}\|\llbracket X^2 \rrbracket\|_{\CC^{-\kappa}(\rho^\sigma)}. \]
\rmbb{Remark that we chose different values of the parameter $L$ in the localization of $X$ and $\llbracket X^{2}\rrbracket$, namely, $L=K$ for $X$ and $L=K/2$ for $\llbracket X^{2}\rrbracket$.}
From this we have
$$
\| \Phi \|_{\CC^{\alpha - 2} (\rho)} \lesssim \|\llbracket X^3 \rrbracket\|_{\CC^{-\kappa}(\rho^\sigma)} + \|\phi+\psi\|_{L^\infty(\rho)}\| \UU_{>} \llbracket X^2 \rrbracket \|_{\CC^{\alpha - 2}}+\|\phi+\psi\|_{L^\infty(\rho)}^2\| \UU_{>} X \|_{\CC^{\alpha - 2} (\rho^{- 1})}
$$
$$
\lesssim 1+ \|\phi+\psi\|_{L^\infty(\rho)} 2^{-(2-\kappa-\alpha)K/2}+\|\phi+\psi\|_{L^\infty(\rho)}^2 2^{-(2-\kappa-\alpha)K}.
$$
Hence it follows from the Schauder estimates that
\begin{equation}\label{eq:44e}
\|\phi\|_{\CC^\alpha(\rho)}\lesssim \| \Phi \|_{\CC^{\alpha - 2} (\rho)} \lesssim 1+  \|\phi+\psi\|_{L^\infty(\rho)} 2^{-(2-\kappa-\alpha)K/2}+\|\phi+\psi\|_{L^\infty(\rho)}^2 2^{-(2-\kappa-\alpha)K}.
\end{equation}

\rmbb{This leads us to the precise choice of the parameter $K$. In particular, we recall that since the equation for $\phi+\psi$ does not contain any localizers, the norm $\|\phi+\psi\|_{L^{\infty} (\rho)}$ does not depend on the particular choice of the localizers and is finite by assumption.
Let $K>0$ be such that $1+\|\phi+\psi\|_{L^\infty(\rho)}= 2^{(2-\kappa-\alpha)K/2}$. Then  in view of the embedding \eqref{eq:emb} we deduce from \eqref{eq:44e}
\begin{equation*}
\| \phi \|_{L^{\infty} (\rho)} + \| \phi \|_{\CC^{\alpha} (\rho^{1 +
   \alpha})}+ \| \phi \|_{\CC^{\beta} (\rho^{1 +
   \beta})} \lesssim 1
   \end{equation*}
  where the constant on the right hand side depends on the noise terms but  is independent of $ K$.
  Consequently,
  $$2^{(2-\kappa-\alpha)K/2}\leq 1+\|\phi\|_{L^\infty(\rho)}+\|\psi\|_{L^\infty(\rho)}\lesssim 1+\|\psi\|_{L^\infty(\rho)}$$
 and hence $2^{(\alpha+\kappa)K}\lesssim 1+ \|\psi\|_{L^\infty(\rho)}^{\varepsilon} $ for some $\varepsilon\in (0,1)$ independent of $K$. The implicit constant (here and in the sequel) is also independent of $K$. The parameter $K$ remains fixed for the rest of the analysis of the elliptic $\Phi^{4}_{4}$ model.} 

To proceed with our a priori estimate, we have
\begin{align}\label{eq:psi1}
\begin{aligned}
\| \Psi_1 \|_{\CC^{\beta} (\rho^{3 + \beta})}& \lesssim (1+\|\psi\|_{L^\infty(\rho)})(1+\|\psi\|_{\CC^{\beta}(\rho^{1+\beta})})\\
\| \Psi_1 \|_{L^{\infty} (\rho^3)} &\lesssim  1+
   \| \psi \|^2_{L^{\infty}
   (\rho)}
\end{aligned}
\end{align}
and
   \[ \| \Psi_2 \|_{\CC^{\beta} (\rho^{3 + \beta})} \lesssim (1+\|\psi\|_{L^\infty(\rho)} ) \|\UU_{\leq}\llbracket X^2\rrbracket\|_{\CC^{\beta}(\rho^{2+\beta})}+(1+\|\psi\|^2_{L^\infty(\rho)} ) \|\UU_{\leq} X\|_{\CC^{\beta}(\rho^{1+\beta})}\]
   \[+(1+\|\psi\|_{\CC^\alpha(\rho^{1+\alpha})} ) \|\llbracket X^2\rrbracket\|_{\CC^{-\kappa}(\rho^{2-\kappa})}+(1+\|\psi\|_{\CC^{\alpha}(\rho^{1+\alpha})} )(1+\|\psi\|_{L^\infty(\rho)}) \| X\|_{\CC^{-\kappa}(\rho^{1-\kappa})}\]
   \begin{equation}\label{eq:psi2}
   \lesssim 1+ \|\psi\|_{L^\infty(\rho)}^{2+\varepsilon}+\|\psi\|_{\CC^\alpha(\rho^{1+\alpha})}(1+\|\psi\|_{L^\infty(\rho)}),
   \end{equation}
   which implies also
\begin{equation}\label{eq:psi22}
\| \Psi_2 \|_{L^{\infty} (\rho^3)} \lesssim 1+\|\psi\|_{L^\infty(\rho)}^{2+\varepsilon}+\|\psi\|_{\CC^\alpha(\rho^{1+\alpha})}(1+\|\psi\|_{L^\infty(\rho)}). 
\end{equation}
Thus, according to Lemma \ref{lemma:schauder-ellptic}
\[
\|\psi\|_{\CC^{2+\beta}(\rho^{3+\beta})}\lesssim 1+\|\psi\|_{\CC^\alpha(\rho^{1+\alpha})}(1+\|\psi\|_{L^\infty(\rho)})+\|\psi\|_{L^\infty(\rho)}^{3+\beta} .\]
 Since we have due to Lemma \ref{lemma:interp}
 \[
\|\psi\|_{\CC^{\alpha}(\rho^{1+\alpha})} \lesssim \|\psi\|_{L^\infty(\rho)}^{1-\alpha/(2+\beta)}\|\psi\|^{\alpha/(2+\beta)}_{\CC^{2+\beta}(\rho^{3+\beta})},
\]
the weighted Young inequality yields
\[
\|\psi\|_{\CC^{2+\beta}(\rho^{3+\beta})} \lesssim 1+\|\psi\|_{L^\infty(\rho)}^{3+\beta}.
\]
Now, it holds
 \[ 
 \|\Psi\|_{L^\infty(\rho^3)}\lesssim 1+\|\psi\|_{L^\infty(\rho)}^{2+\varepsilon}
+\|\psi\|_{\CC^\alpha(\rho^{1+\alpha})} (1 + \|\psi\|_{L^\infty(\rho)}) \]
\[
\lesssim 1+\|\psi\|_{L^\infty(\rho)}^{2+\varepsilon}
+\|\psi\|_{L^\infty(\rho)}^{1-\alpha/(2+\beta)}\|\psi\|^{\alpha/(2+\beta)}_{\CC^{2+\beta}(\rho^{3+\beta})}(1 + \|\psi\|_{L^\infty(\rho)})
\]
\begin{equation}\label{eq:psi3}
\lesssim 1+\|\psi\|^{2+\varepsilon}_{L^\infty(\rho)}
\end{equation}
for some $\varepsilon\in (0,1)$.
Consequently, Lemma \ref{lemma:apriori-elliptic} implies 
\[
\|\psi\|_{L^\infty(\rho)}\lesssim 1+ \|\psi\|^{1-\varepsilon}_{L^\infty(\rho)}
\]
for some $\varepsilon\in (0,1)$. Therefore we deduce
\[
   \|\psi\|_{L^\infty(\rho)}\lesssim 1.
   \]

\subsection{Existence}
\label{ssec:ex44}

As the first step towards the  existence of solutions to the elliptic $\Phi^{4}$ model \eqref{eq:phi4} in dimension~$4$, we consider the problem on a large torus $\mathbb{T}_{M}^{4}$ of a fixed size $M\in\N$. As observed in Section \ref{ssec:decomp}, it reduces to solving the system \eqref{eq:44b}, \eqref{eq:44c} with the space white noise $\xi$ as well as the probabilistic objects $X,\llbracket X^{2}\rrbracket$ and $\llbracket X^{3}\rrbracket$ replaced by their periodic approximations $\xi_{M},X_{M},\llbracket X_{M}^{2}\rrbracket$ and $\llbracket X_{M}^{3}\rrbracket$. We refer to Section~\ref{ssec:renorm-el} for details of the probabilistic construction.

The proof of existence will be divided into two steps. First, we construct a suitable fixed point map
$$\mathcal{K}:\CC^{\beta}(\mathbb{T}_{M}^{4})\times \CC^{\beta}(\mathbb{T}_{M}^{4})\to\CC^{\beta}(\mathbb{T}_{M}^{4})\times \CC^{\beta}(\mathbb{T}_{M}^{4}).$$
Second, we apply Schaefer's fixed point theorem \cite[Section 9.2.2, Theorem 4]{E98} to  show that $\mathcal{K}$ has a fixed point.
More precisely, we define the mapping $\mathcal{K}$ as follows: given
$$(\tilde\phi,\tilde\psi)\in \CC^{\beta}(\mathbb{T}_{M}^{4})\times \CC^{\beta}(\mathbb{T}_{M}^{4}),$$
let $\mathcal{K}(\tilde\phi,\tilde\psi)=(\phi,\psi)$ be a solution to 
\begin{equation}\label{eq:44d}
\Q \phi + \Phi(\tilde\phi,\tilde\psi) = 0, \qquad \Q \psi + \psi^3 + \Psi(\tilde\phi,\tilde\psi) = 0,
\end{equation}
where
\begin{align*}
\begin{aligned}
\Phi(\tilde\phi,\tilde\psi)&= \llbracket X^3 \rrbracket + 3 (\tilde\phi + \tilde\psi) \prec \UU_{>}
   \llbracket X^2 \rrbracket + 3 (\tilde\phi + \tilde\psi)^2 \prec \UU_{>} X,\\
\Psi(\tilde\phi,\tilde\psi)&= \Psi_1(\tilde\phi,\tilde\psi) + \Psi_2(\tilde\phi,\tilde\psi), \\
 \Psi_{1}(\tilde\phi,\tilde\psi)&=\tilde\phi^3 + 3 \tilde\psi \tilde\phi^2 + 3 \tilde\psi^2 \tilde\phi,\\
\Psi_{2}(\tilde\phi,\tilde\psi)&= 3 (\tilde\phi + \tilde\psi) \prec \UU_{\leqslant} \llbracket X^2
   \rrbracket + 3 (\tilde\phi + \tilde\psi)^2 \prec \UU_{\leqslant} X + 3 (\tilde\phi +
   \tilde\psi) \succcurlyeq \llbracket X^2 \rrbracket + 3 (\tilde\phi + \tilde\psi)^2
   \succcurlyeq X.
\end{aligned}
\end{align*}   
Note that the first equation in \eqref{eq:44d} always has a (unique) solution $\phi$ which belongs to $\CC^{\alpha}(\mathbb{T}_{M}^{4})$  due to \eqref{eq:44e}. Indeed, in view of the given regularity of $(\tilde\phi,\tilde\psi)$ and the estimates from Section \ref{ssec:apr} imply  (recall that $\alpha=\beta+\kappa$)
$$
\|\Phi(\tilde\phi,\tilde\psi)\|_{\CC^{\alpha-2}(\mathbb{T}_{M}^{4})}\leq c(\|\tilde\phi+\tilde\psi\|_{\CC^{\beta}(\mathbb{T}_{M}^{4})}).
$$
Next,  we observe that due to \eqref{eq:psi1}, \eqref{eq:psi2} (performed on $\mathbb{T}^{4}_{M}$) the term $\Psi(\tilde\phi,\tilde\psi)$ belongs to $\CC^{\gamma}(\mathbb{T}^{4}_{M})$ provided   $(\tilde\phi,\tilde\psi)\in \CC^{\beta}(\mathbb{T}^{4}_{M})\times \CC^{\beta} (\mathbb{T}^{4}_{M})$ and $\gamma=\beta-\kappa$. Hence according to Proposition~\ref{prop:aux-1} there exists $\psi$ which is a unique classical solution of the second equation in \eqref{eq:44d} and belongs to $\CC^{2+\gamma}(\mathbb{T}_{M}^{4})$. This shows that the map $\mathcal{K}$ is well-defined.
As the next step, we will show that the map $\mathcal{K}$ has a fixed point.

\begin{proposition}\label{prop:fixp}
There exists $(\phi,\psi)\in\CC^{\beta}(\mathbb{T}_{M}^{4})\times \CC^{\beta}(\mathbb{T}_{M}^{4})$ such that $(\phi,\psi)=\mathcal{K}(\phi,\psi).$ Moreover, $(\phi,\psi)$ belongs to $\CC^{\alpha} (\mathbb{T}_{M}^{4})\times \CC^{2+\beta}(\mathbb{T}_{M}^{4})$ for $\alpha=\beta+\kappa$.
\end{proposition}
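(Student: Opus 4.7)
The strategy is to apply Schaefer's fixed point theorem in the Banach space $X=\CC^{\beta}(\mathbb{T}_{M}^{4})\times \CC^{\beta}(\mathbb{T}_{M}^{4})$. Three ingredients must be verified: (i) $\mathcal{K}$ is continuous; (ii) $\mathcal{K}$ is compact, i.e.\ sends bounded sets to relatively compact sets; (iii) the set $\{(\phi,\psi)\in X:(\phi,\psi)=\lambda\mathcal{K}(\phi,\psi)\text{ for some }\lambda\in[0,1]\}$ is bounded in $X$. The discussion preceding the proposition already shows that $\mathcal{K}$ takes values in $\CC^{\alpha}(\mathbb{T}^4_M)\times \CC^{2+\gamma}(\mathbb{T}^4_M)$ with $\alpha=\beta+\kappa$ and $\gamma=\beta-\kappa$, and the claimed regularity of the fixed point will be recovered by a brief bootstrap at the end.

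For continuity and compactness, it suffices to show that $\mathcal{K}$ is bounded from bounded subsets of $X$ into bounded subsets of $\CC^{\alpha}\times\CC^{2+\gamma}$ and depends continuously on its argument there. The bound on $\phi=-\Q^{-1}\Phi(\tilde\phi,\tilde\psi)$ in $\CC^{\alpha}$ is a direct consequence of the Schauder equivalence for $\Q$ combined with the paraproduct estimates of Lemma~\ref{lem:para} applied termwise in the definition of $\Phi$, using the (finite, $M$-dependent) bounds on the stochastic objects from Theorem~\ref{thm:renorm}. For $\psi$ one uses Proposition~\ref{prop:aux-1} together with the analogous paraproduct control of $\Psi$. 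Continuity in $(\tilde\phi,\tilde\psi)$ follows from the same bounds applied to differences: $\Phi$ is polynomial of degree two and $\Psi$ of degree three in its arguments, and continuous dependence on the source for the semilinear elliptic equation $\Q\psi+\psi^3=-\Psi$ is classical by monotonicity. Compactness then follows from the compact embeddings $\CC^{\alpha}(\mathbb{T}^4_M)\hookrightarrow \CC^{\beta}(\mathbb{T}^4_M)$ and $\CC^{2+\gamma}(\mathbb{T}^4_M)\hookrightarrow \CC^{\beta}(\mathbb{T}^4_M)$ since $\alpha>\beta$ and $2+\gamma>\beta$.

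The a priori bound is the heart of the matter. The case $\lambda=0$ gives $(\phi,\psi)=0$ trivially. For $\lambda\in(0,1]$, introduce the rescaled pair $(\hat\phi,\hat\psi)=(\phi/\lambda,\psi/\lambda)=\mathcal{K}(\phi,\psi)$. By definition of $\mathcal{K}$,
\[
\Q\hat\phi+\Phi(\lambda\hat\phi,\lambda\hat\psi)=0,\qquad \Q\hat\psi+\hat\psi^3+\Psi(\lambda\hat\phi,\lambda\hat\psi)=0.
\]
This is structurally identical to the system analyzed in Section~\ref{ssec:apr} with $(\tilde\phi,\tilde\psi)=(\lambda\hat\phi,\lambda\hat\psi)$, and since every additional factor of $\lambda$ introduced in $\Phi$ and $\Psi$ lies in $[0,1]$, the entire chain of estimates carries through verbatim: choosing $K$ so that $1+\lambda\|\hat\phi+\hat\psi\|_{L^{\infty}}=2^{(2-\kappa-\alpha)K/2}$ one obtains $\|\hat\phi\|_{L^{\infty}}+\|\hat\phi\|_{\CC^{\alpha}}\lesssim 1$, and then Lemma~\ref{lemma:schauder-ellptic}, Lemma~\ref{lemma:interp} and Lemma~\ref{lemma:apriori-elliptic} together yield $\|\hat\psi\|_{L^{\infty}}\lesssim 1$ and finally $\|\hat\psi\|_{\CC^{2+\beta}}\lesssim 1$, all uniformly in $\lambda$. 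Since $\lambda\leq 1$, we deduce $\|(\phi,\psi)\|_X\leq\|(\hat\phi,\hat\psi)\|_X\lesssim 1$, which is the required uniform bound.

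Schaefer's theorem therefore produces a fixed point $(\phi,\psi)\in X$, and the improved regularity follows by bootstrap: since $(\phi,\psi)\in\CC^{\beta}\times\CC^{\beta}$, the paraproduct estimates show $\Phi\in\CC^{\alpha-2}$ and Schauder gives $\phi\in\CC^{\alpha}$; with this improved $\phi$ the analysis of Section~\ref{ssec:apr} shows $\Psi\in\CC^{\beta}$ (not merely $\CC^{\gamma}$), and Lemma~\ref{lemma:schauder-ellptic} yields $\psi\in\CC^{2+\beta}$. The main obstacle I anticipate is verifying that the rescaled system truly reproduces the Section~\ref{ssec:apr} estimates without any hidden $\lambda^{-1}$ factor; this is the precise role of the substitution $(\hat\phi,\hat\psi)=(\phi/\lambda,\psi/\lambda)$, which preserves the coercive $\hat\psi^3$ term and pushes all $\lambda$'s into the arguments of $\Phi$ and $\Psi$, where they are benign.
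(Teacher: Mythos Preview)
Your proof is correct and follows the same overall scheme as the paper: Schaefer's theorem, with continuity and compactness coming from the compact embeddings $\CC^{\alpha}\hookrightarrow\CC^{\beta}$ and $\CC^{2+\gamma}\hookrightarrow\CC^{\beta}$, and the main work being the uniform a~priori bound for $(\phi,\psi)=\lambda\mathcal{K}(\phi,\psi)$.

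The one genuine difference is in how you handle the $\lambda$-dependence in the a~priori bound. The paper keeps the original unknowns and writes the system as
\[
\Q\phi+\lambda\Phi(\phi,\psi)=0,\qquad \Q\psi+\lambda^{-2}\psi^{3}+\lambda\Psi(\phi,\psi)=0,
\]
so that a singular coefficient $\lambda^{-2}$ sits in front of the cubic term. Revisiting Lemma~\ref{lemma:schauder-ellptic} then produces a factor $\lambda^{-(2+\beta)}\|\psi\|_{L^{\infty}}^{3+\beta}$; this is compensated by the sharpened maximum principle, which for this equation yields $\|\psi\|_{L^{\infty}}\lesssim\lambda$, and the powers of $\lambda$ cancel exactly. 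Your rescaling $(\hat\phi,\hat\psi)=(\phi/\lambda,\psi/\lambda)$ avoids this bookkeeping entirely: the resulting system has the unscaled cubic $\hat\psi^{3}$, and all $\lambda$'s migrate into the \emph{arguments} of $\Phi$ and $\Psi$, where they appear only as multiplicative factors $\lambda,\lambda^{2},\lambda^{3}\leqslant 1$. The Section~\ref{ssec:apr} estimates then run through without any negative power of $\lambda$ ever appearing. The two routes are equivalent, but yours is a bit cleaner; the paper's has the minor advantage of displaying explicitly the extra smallness $\|\psi\|_{L^{\infty}}\lesssim\lambda$.

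For the final regularity claim, note that you can shortcut the bootstrap: at a fixed point the system is precisely the one analyzed in Section~\ref{ssec:apr}, so those a~priori estimates apply directly and give $\psi\in\CC^{2+\beta}(\mathbb{T}_{M}^{4})$ in one step, which is how the paper concludes.
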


\begin{proof}
We intend to apply the Schaefer's fixed point theorem which can be found in \cite[Section 9.2.2, Theorem 4]{E98}. To this end, it is necessary to verify that the map $\mathcal{K}$ is continuous and compact and the set
\begin{equation}\label{eq:44f}
\{(\phi,\psi)\in\CC^{\beta}(\mathbb{T}_{M}^{4})\times \CC^{\beta}(\mathbb{T}_{M}^{4});\, (\phi,\psi)=\lambda\mathcal{K}(\phi,\psi) \text{ for some } 0\leqslant \lambda\leqslant 1\}
\end{equation}
is bounded.

\emph{Continuity and compactness:} Assume that $(\tilde\phi_{n},\tilde\psi_{n})\to(\tilde\phi,\tilde\psi)$ in $\CC^{\beta}(\mathbb{T}_{M}^{4})\times \CC^{\beta}(\mathbb{T}_{M}^{4})$ and denote $(\phi_{n},\psi_{n})=\mathcal{K}(\tilde\phi_{n},\tilde\psi_{n}).$ First, we observe that a slight modification of \eqref{eq:psi1}, \eqref{eq:psi2} and \eqref{eq:psi22} shows that
\begin{equation}\label{eq:44ha}
\|\Psi(\tilde\phi_{n},\tilde\psi_{n})\|_{\CC^{\gamma}(\mathbb{T}_{M}^{4})}\leqslant c\big(\|\tilde\phi_{n}+\tilde\psi_{n}\|_{\CC^{\beta}(\mathbb{T}_{M}^{4})}\big)\lesssim 1
\end{equation}
uniformly in $n$. Hence due to the Schauder estimates and Lemma \ref{lemma:schauder-ellptic}, it follows
\begin{equation}\label{eq:44hb}
\|\phi_{n}\|_{\CC^{\alpha}(\mathbb{T}_{M}^{4})}+\|\psi_{n}\|_{\CC^{2+\gamma}(\mathbb{T}_{M}^{4})}\lesssim 1
\end{equation}
uniformly in $n$.
According to the compact embedding \eqref{eq:emb} we deduce that there exists a subsequence still denoted by $(\phi_{n},\psi_{n})$ which converges to certain $(\phi,\psi)$ in $\CC^{\beta}(\mathbb{T}_{M}^{4})\times \CC^{\beta}(\mathbb{T}_{M}^{4})$. Moreover, due to the  uniform bound \eqref{eq:44hb}, it holds
\begin{equation*}
\|\phi\|_{\CC^{\alpha}(\mathbb{T}_{M}^{4})}+\|\psi\|_{\CC^{2+\gamma}(\mathbb{T}_{M}^{4})}\lesssim 1.
\end{equation*}
Since $\Phi$ as well as $\Psi$ in \eqref{eq:44d} depends continuously on $(\tilde\phi_{n},\tilde\psi_{n})$, which can be seen by similar estimates as in Section \ref{ssec:apr}, we may pass to the limit and conclude that $(\phi,\psi)=\mathcal{K}(\tilde\phi,\tilde\psi)$. In view of uniqueness, we deduce that every subsequence converges to the same limit which implies that the whole sequence converges and the desired  continuity of $\mathcal{K}$ follows.
Furthermore, compactness of  $\mathcal{K}$ is also a direct consequence of the bound \eqref{eq:44hb}.

\emph{Boundedness of \eqref{eq:44f}:}  If $(\phi,\psi)=\lambda\mathcal{K}(\phi,\psi) $ for some $0< \lambda\leqslant 1$, then $(\lambda^{-1}\phi,\lambda^{-1}\psi)=\mathcal{K}(\phi,\psi) $ hence
\begin{equation}\label{eq:44g}
\Q \phi + \lambda \Phi(\phi,\psi) = 0, \qquad \Q \psi + \frac{1}{\lambda^{2}}\psi^3 + \lambda\Psi(\phi,\psi) = 0.
\end{equation}
We shall modify the a priori estimates from Section \ref{ssec:apr} in order to account for the parameter $\lambda$ and obtain bounds uniform in $\lambda$. First, we observe that the first equation in \eqref{eq:44g} does not cause any difficulties as $\|\lambda\Phi(\phi,\psi)\|_{\CC^{\alpha-2}(\mathbb{T}_{M}^{4})}\leqslant \|\Phi(\phi,\psi)\|_{\CC^{\alpha-2}(\mathbb{T}_{M}^{4})}$. Consequently, as in  \eqref{eq:44e} we deduce that
$$
\|\phi\|_{\CC^{\alpha}(\mathbb{T}_{M}^{4})}\lesssim 1
$$
uniformly in $\lambda$. The same approach can be applied to the bounds \eqref{eq:psi1}, \eqref{eq:psi2} and \eqref{eq:psi22} which remain unchanged and independent of  $ \lambda$. Revisiting the proof of Lemma \ref{lemma:schauder-ellptic} we obtain
\begin{equation}\label{eq:44h}
\|\psi\|_{\CC^{2+\beta}(\mathbb{T}_{M}^{4})}\lesssim \|\Psi\|_{\CC^{\beta}(\mathbb{T}_{M}^{4})}+\frac{1}{\lambda^{2+\beta}}\|\psi\|_{L^{\infty}(\mathbb{T}_{M}^{4})}^{3+\beta}.
\end{equation}
In order to control the right hand side uniformly in $\lambda$ we revisit the proof of Lemma \ref{lemma:apriori-elliptic} and observe that it simplifies since the weight is not needed on the  torus. Then we apply  \eqref{eq:psi3} and we obtain
$$
\|\psi\|_{L^{\infty}(\mathbb{T}_{M}^{4})}\leqslant \lambda\|\Psi\|^{{1/3}}_{L^{\infty}(\mathbb{T}_{M}^{4})}\lesssim \lambda\big(1+\|\psi\|_{L^{\infty}(\mathbb{T}_{M}^{4})}^{1-\varepsilon}\big)
$$
for some $\varepsilon\in (0,1)$.
Hence, by the weighted Young inequality, we deduce
$$
\|\psi\|_{L^{\infty}(\mathbb{T}_{M}^{4})}\lesssim\lambda.
$$
Plugging this into \eqref{eq:44h} and using the bound for $\Psi$ in \eqref{eq:psi1}, \eqref{eq:psi2} leads to
$$
\|\psi\|_{\CC^{2+\beta}(\mathbb{T}_{M}^{4})}\lesssim 1,
$$
uniformly in $\lambda$ and the boundedness of \eqref{eq:44f} follows.

Finally, Schaefer's fixed point theorem \cite[Section 9.2.2, Theorem 4]{E98} gives the existence of a fixed point of $\mathcal{K}$. Moreover, the a priori estimates from Section \ref{ssec:apr} show that $\psi\in \CC^{2+\beta}(\mathbb{T}^{4}_{M}).$
\end{proof}

Therefore, we have proved the following result.

\begin{theorem}\label{thm:ex-44-per}
Let $M\in\mathbb{N}$. Let $\kappa, \alpha\in (0,1)$ be chosen sufficiently small and let $\beta=\alpha-\kappa>0$.  There exists $(\phi,\psi)\in \CC^{\alpha} (\mathbb{T}_{M}^{4})\times \CC^{2+\beta}(\mathbb{T}_{M}^{4})$ which is a solution to \eqref{eq:44b}, \eqref{eq:44c} on $\mathbb{T}_{M}^{4}$.
\end{theorem}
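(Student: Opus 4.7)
The plan is to observe that Theorem~\ref{thm:ex-44-per} is a direct consequence of the preceding Proposition~\ref{prop:fixp}. By the very definition of the map $\mathcal{K}$ in Section~\ref{ssec:ex44}, a pair $(\phi,\psi)$ satisfies $(\phi,\psi)=\mathcal{K}(\phi,\psi)$ if and only if it solves the system \eqref{eq:44b}, \eqref{eq:44c} on $\mathbb{T}^4_M$ (with the stochastic objects replaced by their periodic approximations). Since Proposition~\ref{prop:fixp} produces such a fixed point and already asserts the regularity $(\phi,\psi)\in\CC^{\alpha}(\mathbb{T}^4_M)\times\CC^{2+\beta}(\mathbb{T}^4_M)$, the theorem follows at once; no additional argument is required once the proposition is in hand.

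Were one instead to prove the theorem from scratch, the natural route is Schaefer's fixed point theorem applied to $\mathcal{K}:\CC^{\beta}(\mathbb{T}^4_M)^2\to\CC^{\beta}(\mathbb{T}^4_M)^2$. I would check in order: (i) that $\mathcal{K}$ is well-defined---for $\phi$ this uses the linear Schauder estimate applied to $\Phi(\tilde\phi,\tilde\psi)\in\CC^{\alpha-2}$, while for $\psi$ one needs a smooth existence result for the semilinear torus equation (invoking Proposition~\ref{prop:aux-1} from the appendix); (ii) continuity and compactness of $\mathcal{K}$, which follow from uniform bounds of the form $\|\phi\|_{\CC^{\alpha}}+\|\psi\|_{\CC^{2+\gamma}}\lesssim 1$ with $\gamma=\beta-\kappa$ on bounded subsets of $\CC^{\beta}\times\CC^{\beta}$, combined with the compact Besov embedding~\eqref{eq:emb}; and (iii) boundedness of the set \eqref{eq:44f}, which is a $\lambda$-rescaled version of the a priori estimates developed in Section~\ref{ssec:apr}.

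The main obstacle is step~(iii): after rescaling one encounters the equation $\Q\psi+\lambda^{-2}\psi^{3}+\lambda\Psi(\phi,\psi)=0$, and the factor $\lambda^{-2}$ in front of the cubic term threatens to ruin every uniform bound as $\lambda\to 0$. The resolution I would pursue is to exploit the compensating factor $\lambda$ on the right-hand side: applying the torus version of Lemma~\ref{lemma:apriori-elliptic} gives $\|\psi\|_{L^{\infty}}\lesssim\lambda\|\Psi\|_{L^{\infty}}^{1/3}\lesssim\lambda(1+\|\psi\|_{L^{\infty}}^{1-\varepsilon})$, hence $\|\psi\|_{L^{\infty}}\lesssim\lambda$. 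Feeding this back into the Schauder-type bound $\|\psi\|_{\CC^{2+\beta}}\lesssim\|\Psi\|_{\CC^{\beta}}+\lambda^{-(2+\beta)}\|\psi\|_{L^{\infty}}^{3+\beta}$ cancels the dangerous negative power of $\lambda$ and closes the loop uniformly. The linear $\phi$-equation causes no trouble since $\|\lambda\Phi(\phi,\psi)\|_{\CC^{\alpha-2}}\leq\|\Phi(\phi,\psi)\|_{\CC^{\alpha-2}}$, so the estimate \eqref{eq:44e} carries through verbatim.
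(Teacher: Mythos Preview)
Your proposal is correct and matches the paper's approach exactly: the paper introduces Theorem~\ref{thm:ex-44-per} with the single sentence ``Therefore, we have proved the following result,'' deducing it immediately from Proposition~\ref{prop:fixp}. Your sketch of the from-scratch argument (Schaefer's theorem, well-definedness via Proposition~\ref{prop:aux-1}, compactness via \eqref{eq:emb}, and the $\lambda$-rescaled a priori estimate with the key cancellation $\lambda^{-(2+\beta)}\|\psi\|_{L^\infty}^{3+\beta}\lesssim\lambda^{-(2+\beta)}\lambda^{3+\beta}=\lambda$) also reproduces the content of the paper's proof of Proposition~\ref{prop:fixp} faithfully.
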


With this in hand, we are able to conclude the proof of existence on $\R^{4}$.

\begin{theorem}\label{thm:ex43aaa}
Let $\kappa, \alpha\in (0,1)$ be chosen sufficiently small and let $\beta=\alpha-\kappa>0$. There exists $ (\phi,\psi)\in \CC^{\alpha}(\rho)\times[\CC^{2+\beta}(\rho^{3+\beta})\cap L^{\infty}(\rho)]$  which is a solution to \eqref{eq:44b}, \eqref{eq:44c} on $\R^{4}$.
\end{theorem}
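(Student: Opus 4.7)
The strategy will be to pass to the infinite volume limit starting from the periodic solutions of Theorem~\ref{thm:ex-44-per}. For each $M \in \N$ I would first invoke Theorem~\ref{thm:ex-44-per} to obtain a classical periodic solution $(\phi_M, \psi_M) \in \CC^\alpha(\mathbb{T}^4_M) \times \CC^{2+\beta}(\mathbb{T}^4_M)$ of the decomposed system with stochastic data replaced by the periodised objects $X_M, \llbracket X_M^2\rrbracket, \llbracket X_M^3\rrbracket$, and then extend $(\phi_M, \psi_M)$ $\mathbb{T}^4_M$-periodically to all of $\R^4$. Since the periodised data of Theorem~\ref{thm:renorm} are defined on the whole space and uniformly bounded there in every weighted norm $\CC^{-\kappa}(\rho^\sigma)$, the extension is a classical solution on $\R^4$ of \eqref{eq:44b}, \eqref{eq:44c} with these periodic data.

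The second step is to apply the a priori analysis of Section~\ref{ssec:apr} verbatim to $(\phi_M, \psi_M)$ viewed on $\R^4$. Because $\phi_M+\psi_M$ is periodic and bounded while $\rho$ decays at infinity, the posteriori localisation scale $K_M$ dictated by $\|\phi_M+\psi_M\|_{L^\infty(\rho)}$ is finite, and the implicit constants in \eqref{eq:44e} and in the bounds \eqref{eq:psi1}--\eqref{eq:psi3} depend on $M$ only through the weighted Besov norms of the stochastic data, which are uniformly controlled by Theorem~\ref{thm:renorm}. Chaining the Schauder bound \eqref{eq:44e}, the interpolation of Lemma~\ref{lemma:interp}, and the maximum principle of Lemma~\ref{lemma:apriori-elliptic} then produces a uniform estimate
$$\sup_M \Big( \|\phi_M\|_{\CC^\alpha(\rho)} + \|\psi_M\|_{L^\infty(\rho)} + \|\psi_M\|_{\CC^{2+\beta}(\rho^{3+\beta})} \Big) \lesssim 1.$$

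Having secured these uniform bounds, I would invoke the compactness of the embedding in \eqref{eq:emb} to extract a subsequence $M_k \to \infty$ along which $\phi_{M_k} \to \phi$ in $\CC^{\alpha'}(\rho^{1+\alpha'})$ and $\psi_{M_k} \to \psi$ in $\CC^{2+\beta'}(\rho^{3+\beta'+\eta})$ for some $\alpha'<\alpha$, $\beta'<\beta$, $\eta>0$, the limits inheriting the uniform bounds. Combined with the convergence $X_{M_k} \to X$, $\llbracket X_{M_k}^2\rrbracket \to \llbracket X^2\rrbracket$, $\llbracket X_{M_k}^3\rrbracket \to \llbracket X^3\rrbracket$ in $\CC^{-\kappa}(\rho^\sigma)$ furnished by Theorem~\ref{thm:renorm}, the paraproduct and resonant bounds of Lemma~\ref{lem:para} together with the localisation estimates of Lemma~\ref{lem:local} let every term in the definition \eqref{eq:44c} of $\Phi$ and $\Psi$ converge in a suitable weighted Besov space, so that the limit $(\phi, \psi)$ satisfies \eqref{eq:44b}, \eqref{eq:44c} on $\R^4$.

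The main subtlety I expect lies in harmonising the localisation scales. A priori each $K_M$ depends on $M$ through $\|\phi_M+\psi_M\|_{L^\infty(\rho)}$, and the localisers $\UU_{>}, \UU_{\leqslant}$ themselves change with $K_M$. The uniform $L^\infty(\rho)$ bound just obtained yields a uniform upper bound on the $K_M$, so, extracting a further subsequence, one may fix a single $L>0$ in the definition of $\UU_{>}$ and $\UU_{\leqslant}$ common to all $M_k$; once $L$ is fixed these operators are bounded Fourier multipliers of a type for which commutation with weak$^*$ limits is routine, and the rest of the limiting argument reduces to standard paracontrolled continuity.
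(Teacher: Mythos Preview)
Your approach is the paper's: periodic solutions from Theorem~\ref{thm:ex-44-per}, the weighted a priori estimates of Section~\ref{ssec:apr} applied on $\R^4$ uniformly in $M$ via Theorem~\ref{thm:renorm}, then compact embedding and passage to the limit.

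The localiser discussion in your last paragraph is more careful than the paper (which simply says ``passing to the limit in \eqref{eq:44b}, \eqref{eq:44c}''), but your stated resolution is imprecise: extracting a subsequence from a bounded family $(K_M)$ does not make the $K_M$ equal to a single $L$, only convergent, and the operators $\UU_{>},\UU_{\leqslant}$ are not continuous in $L$ in any useful topology (they jump when some $L_k$ crosses an integer). The clean fix is different: observe that $\zeta_M := \phi_M+\psi_M$ satisfies the \emph{localiser-free} sum of the two equations in \eqref{eq:44b}. Once the self-consistent a priori argument gives a uniform bound $\|\zeta_M\|_{L^\infty(\rho)}\leqslant C$, choose a single $L$ with $2^{(2-\kappa-\alpha)L/2}\geqslant 1+C$, and \emph{re-decompose} each $\zeta_M$ by setting $\phi_M^L:=-\Q^{-1}\Phi^L(\zeta_M)$, $\psi_M^L:=\zeta_M-\phi_M^L$; these solve \eqref{eq:44b}, \eqref{eq:44c} with the fixed localiser $L$. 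Re-running the estimates of Section~\ref{ssec:apr} with this frozen $L$ (now $2^{(\alpha+\kappa)L}$ is just a harmless constant) yields uniform bounds on $(\phi_M^L,\psi_M^L)$, and since the localisers no longer depend on $M$ the passage to the limit is routine paraproduct continuity.
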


\begin{proof}
Let  $(\phi_{M},\psi_{M})\in \CC^{\alpha}(\mathbb{T}^{4}_{M})\times\CC^{2+\beta}(\mathbb{T}^{4}_{M})$ denote the solution to \eqref{eq:44b}, \eqref{eq:44c}  constructed in Theorem \ref{thm:ex-44-per}.
Since functions on $\mathbb{T}^{4}_{M}$ can be regarded as periodic functions defined on the full space $\R^{4}$, we may apply the a priori estimates from Section \ref{ssec:apr}. More precisely, in view of Theorem~\ref{thm:renorm}, we conclude that the approximate solutions $(\phi_{M},\psi_{M})$ are bounded uniformly in $M$ in $ \CC^{\alpha}(\rho)\times[\CC^{2+\beta}(\rho^{3+\beta})\cap L^{\infty}(\rho)]$ whenever $\rho$ is a polynomial bound. Due to \eqref{eq:emb}, this space is compactly embedded into $\CC^{\alpha'}(\rho^{1+\alpha'})\times\CC^{2+\beta'}(\rho^{3+\beta''})$ provided $\alpha'<\alpha$ and $\beta'<\beta<\beta''$. Therefore, there exists a subsequence, still denoted $(\phi_{M},\psi_{M})$ which converges in $\CC^{\alpha'}(\rho^{1+\alpha'})\times\CC^{2+\beta'}(\rho^{3+\beta''})$ to certain $(\phi,\psi)\in  \CC^{\alpha}(\rho)\times[\CC^{2+\beta}(\rho^{3+\beta})\cap L^{\infty}(\rho)]$. Passing to the limit in  \eqref{eq:44b}, \eqref{eq:44c} concludes the proof of existence on the full space.
\end{proof}

Finally, we note that a priori we do not know whether the solution constructed in Theorem~\ref{thm:ex43aaa} is a well-defined random variable, that is, if it is measurable with respect to $\omega$ in the underlying probability space. Indeed, the Schaefer's fixed theorem used in Proposition~\ref{prop:fixp} does not guarantee measurability. However, the existence of a measurable selection can be shown by means of Filippov's implicit function theorem \cite[Theorem 18.17]{AB06}.

\section{Elliptic $\Phi^4_5$ model}
\label{sec:45}

In this section we focus on the elliptic $\Phi^{4}$ model \eqref{eq:phi4} in dimension 5.  First we decompose the equation into a system of equations and establish a priori estimates for the involved quantities. Due to the lower regularity of the driving noise, the analysis is more involved than in Section~\ref{sec:42}. In particular, it is necessary to include additional paracontrolled ansatz, which allows to cancel certain irregular term. Consequently, the a priori estimates become rather delicate and are presented in Sections \ref{ssec:phi1}, \ref{ssec:phi2}, \ref{ssec:theta}, \ref{ssec:psi1}, \ref{ssec:psi2} below.
 This will also serve as a basis for the investigation of the parabolic $\Phi^{4}$ model in dimension 3 in Sections \ref{sec:43}, \ref{s:uniq}, \ref{s:d}.

\subsection{Decomposition into simpler equations}
\label{ssec:dec45}

We  study the elliptic equation
\begin{equation}\label{eq:45}
(- \Delta + \mu) \varphi + \varphi^3 + (- 3 a + 3b) \varphi - \xi = 0 
\end{equation}
in $\mathbbm{R}^5$ where $\xi$ is a space white noise and $a,b$ stand for renormalization constants. We let $(- \Delta +
\mu) = \Q$ and introduce the ansatz
\[ \varphi = X - X^{\!\resizebox{0.6em}{!}{
\begin{tikzpicture}
\pgfpathmoveto{\pgfqpoint{0cm}{-0.035cm}}
\pgfpathlineto{\pgfqpoint{1.376cm}{-0.035cm}}
\pgfpathlineto{\pgfqpoint{1.376cm}{1.552cm}}
\pgfpathlineto{\pgfqpoint{0cm}{1.552cm}}
\pgfpathclose
\pgfusepath{clip}
\begin{pgfscope}
\begin{pgfscope}
\pgfpathmoveto{\pgfqpoint{0cm}{-0.035cm}}
\pgfpathlineto{\pgfqpoint{1.376cm}{-0.035cm}}
\pgfpathlineto{\pgfqpoint{1.376cm}{1.552cm}}
\pgfpathlineto{\pgfqpoint{0cm}{1.552cm}}
\pgfpathclose
\pgfusepath{clip}
\begin{pgfscope}
\begin{pgfscope}
\pgfsetdash{}{0cm}
\pgfsetlinewidth{0.818mm}
\pgfsetroundcap
\pgfsetroundjoin
\pgfsetmiterlimit{7.0}
\definecolor{eps2pgf_color}{gray}{0}\pgfsetstrokecolor{eps2pgf_color}\pgfsetfillcolor{eps2pgf_color}
\pgfpathmoveto{\pgfqpoint{0.117cm}{1.421cm}}
\pgfpathlineto{\pgfqpoint{0.682cm}{0.671cm}}
\pgfpathlineto{\pgfqpoint{1.246cm}{1.421cm}}
\pgfusepath{stroke}
\end{pgfscope}
\definecolor{eps2pgf_color}{gray}{0}\pgfsetstrokecolor{eps2pgf_color}\pgfsetfillcolor{eps2pgf_color}
\pgfpathmoveto{\pgfqpoint{0.273cm}{1.395cm}}
\pgfpathcurveto{\pgfqpoint{0.273cm}{1.432cm}}{\pgfqpoint{0.259cm}{1.467cm}}{\pgfqpoint{0.233cm}{1.492cm}}
\pgfpathcurveto{\pgfqpoint{0.207cm}{1.518cm}}{\pgfqpoint{0.173cm}{1.532cm}}{\pgfqpoint{0.137cm}{1.532cm}}
\pgfpathcurveto{\pgfqpoint{0.1cm}{1.532cm}}{\pgfqpoint{0.066cm}{1.518cm}}{\pgfqpoint{0.04cm}{1.492cm}}
\pgfpathcurveto{\pgfqpoint{0.014cm}{1.467cm}}{\pgfqpoint{0cm}{1.432cm}}{\pgfqpoint{0cm}{1.395cm}}
\pgfpathcurveto{\pgfqpoint{0cm}{1.359cm}}{\pgfqpoint{0.014cm}{1.324cm}}{\pgfqpoint{0.04cm}{1.299cm}}
\pgfpathcurveto{\pgfqpoint{0.066cm}{1.273cm}}{\pgfqpoint{0.1cm}{1.258cm}}{\pgfqpoint{0.137cm}{1.258cm}}
\pgfpathcurveto{\pgfqpoint{0.173cm}{1.258cm}}{\pgfqpoint{0.207cm}{1.273cm}}{\pgfqpoint{0.233cm}{1.299cm}}
\pgfpathcurveto{\pgfqpoint{0.259cm}{1.324cm}}{\pgfqpoint{0.273cm}{1.359cm}}{\pgfqpoint{0.273cm}{1.395cm}}
\pgfusepath{fill}
\begin{pgfscope}
\pgfsetdash{}{0cm}
\pgfsetlinewidth{0.818mm}
\pgfsetmiterlimit{7.0}
\pgfpathmoveto{\pgfqpoint{0.682cm}{0.671cm}}
\pgfpathlineto{\pgfqpoint{0.679cm}{1.418cm}}
\pgfusepath{stroke}
\end{pgfscope}
\pgfpathmoveto{\pgfqpoint{0.815cm}{1.399cm}}
\pgfpathcurveto{\pgfqpoint{0.815cm}{1.435cm}}{\pgfqpoint{0.801cm}{1.47cm}}{\pgfqpoint{0.775cm}{1.496cm}}
\pgfpathcurveto{\pgfqpoint{0.75cm}{1.521cm}}{\pgfqpoint{0.715cm}{1.536cm}}{\pgfqpoint{0.679cm}{1.536cm}}
\pgfpathcurveto{\pgfqpoint{0.643cm}{1.536cm}}{\pgfqpoint{0.608cm}{1.521cm}}{\pgfqpoint{0.582cm}{1.496cm}}
\pgfpathcurveto{\pgfqpoint{0.557cm}{1.47cm}}{\pgfqpoint{0.542cm}{1.435cm}}{\pgfqpoint{0.542cm}{1.399cm}}
\pgfpathcurveto{\pgfqpoint{0.542cm}{1.363cm}}{\pgfqpoint{0.557cm}{1.328cm}}{\pgfqpoint{0.582cm}{1.302cm}}
\pgfpathcurveto{\pgfqpoint{0.608cm}{1.276cm}}{\pgfqpoint{0.643cm}{1.262cm}}{\pgfqpoint{0.679cm}{1.262cm}}
\pgfpathcurveto{\pgfqpoint{0.715cm}{1.262cm}}{\pgfqpoint{0.75cm}{1.276cm}}{\pgfqpoint{0.775cm}{1.302cm}}
\pgfpathcurveto{\pgfqpoint{0.801cm}{1.328cm}}{\pgfqpoint{0.815cm}{1.363cm}}{\pgfqpoint{0.815cm}{1.399cm}}
\pgfusepath{fill}
\pgfpathmoveto{\pgfqpoint{1.345cm}{1.371cm}}
\pgfpathcurveto{\pgfqpoint{1.345cm}{1.408cm}}{\pgfqpoint{1.331cm}{1.442cm}}{\pgfqpoint{1.305cm}{1.468cm}}
\pgfpathcurveto{\pgfqpoint{1.28cm}{1.494cm}}{\pgfqpoint{1.245cm}{1.508cm}}{\pgfqpoint{1.209cm}{1.508cm}}
\pgfpathcurveto{\pgfqpoint{1.172cm}{1.508cm}}{\pgfqpoint{1.138cm}{1.494cm}}{\pgfqpoint{1.112cm}{1.468cm}}
\pgfpathcurveto{\pgfqpoint{1.087cm}{1.442cm}}{\pgfqpoint{1.072cm}{1.408cm}}{\pgfqpoint{1.072cm}{1.371cm}}
\pgfpathcurveto{\pgfqpoint{1.072cm}{1.335cm}}{\pgfqpoint{1.087cm}{1.3cm}}{\pgfqpoint{1.112cm}{1.274cm}}
\pgfpathcurveto{\pgfqpoint{1.138cm}{1.249cm}}{\pgfqpoint{1.172cm}{1.234cm}}{\pgfqpoint{1.209cm}{1.234cm}}
\pgfpathcurveto{\pgfqpoint{1.245cm}{1.234cm}}{\pgfqpoint{1.28cm}{1.249cm}}{\pgfqpoint{1.305cm}{1.274cm}}
\pgfpathcurveto{\pgfqpoint{1.331cm}{1.3cm}}{\pgfqpoint{1.345cm}{1.335cm}}{\pgfqpoint{1.345cm}{1.371cm}}
\pgfusepath{fill}
\begin{pgfscope}
\pgfsetdash{}{0cm}
\pgfsetlinewidth{0.818mm}
\pgfsetroundcap
\pgfsetmiterlimit{4.0}
\pgfpathmoveto{\pgfqpoint{0.682cm}{0.671cm}}
\pgfpathlineto{\pgfqpoint{0.682cm}{0.042cm}}
\pgfusepath{stroke}
\end{pgfscope}
\end{pgfscope}
\end{pgfscope}
\end{pgfscope}
\end{tikzpicture}}} + \phi + \psi\]
with
\[ \Q X = \xi, \quad \llbracket X^3 \rrbracket \assign X^3 - 3 a X, \quad \llbracket X^2
   \rrbracket \assign X^2 - a, \]
\[ \Q X^{\!\resizebox{0.6em}{!}{
\begin{tikzpicture}
\pgfpathmoveto{\pgfqpoint{0cm}{-0.035cm}}
\pgfpathlineto{\pgfqpoint{1.376cm}{-0.035cm}}
\pgfpathlineto{\pgfqpoint{1.376cm}{1.552cm}}
\pgfpathlineto{\pgfqpoint{0cm}{1.552cm}}
\pgfpathclose
\pgfusepath{clip}
\begin{pgfscope}
\begin{pgfscope}
\pgfpathmoveto{\pgfqpoint{0cm}{-0.035cm}}
\pgfpathlineto{\pgfqpoint{1.376cm}{-0.035cm}}
\pgfpathlineto{\pgfqpoint{1.376cm}{1.552cm}}
\pgfpathlineto{\pgfqpoint{0cm}{1.552cm}}
\pgfpathclose
\pgfusepath{clip}
\begin{pgfscope}
\begin{pgfscope}
\pgfsetdash{}{0cm}
\pgfsetlinewidth{0.818mm}
\pgfsetroundcap
\pgfsetroundjoin
\pgfsetmiterlimit{7.0}
\definecolor{eps2pgf_color}{gray}{0}\pgfsetstrokecolor{eps2pgf_color}\pgfsetfillcolor{eps2pgf_color}
\pgfpathmoveto{\pgfqpoint{0.117cm}{1.421cm}}
\pgfpathlineto{\pgfqpoint{0.682cm}{0.671cm}}
\pgfpathlineto{\pgfqpoint{1.246cm}{1.421cm}}
\pgfusepath{stroke}
\end{pgfscope}
\definecolor{eps2pgf_color}{gray}{0}\pgfsetstrokecolor{eps2pgf_color}\pgfsetfillcolor{eps2pgf_color}
\pgfpathmoveto{\pgfqpoint{0.273cm}{1.395cm}}
\pgfpathcurveto{\pgfqpoint{0.273cm}{1.432cm}}{\pgfqpoint{0.259cm}{1.467cm}}{\pgfqpoint{0.233cm}{1.492cm}}
\pgfpathcurveto{\pgfqpoint{0.207cm}{1.518cm}}{\pgfqpoint{0.173cm}{1.532cm}}{\pgfqpoint{0.137cm}{1.532cm}}
\pgfpathcurveto{\pgfqpoint{0.1cm}{1.532cm}}{\pgfqpoint{0.066cm}{1.518cm}}{\pgfqpoint{0.04cm}{1.492cm}}
\pgfpathcurveto{\pgfqpoint{0.014cm}{1.467cm}}{\pgfqpoint{0cm}{1.432cm}}{\pgfqpoint{0cm}{1.395cm}}
\pgfpathcurveto{\pgfqpoint{0cm}{1.359cm}}{\pgfqpoint{0.014cm}{1.324cm}}{\pgfqpoint{0.04cm}{1.299cm}}
\pgfpathcurveto{\pgfqpoint{0.066cm}{1.273cm}}{\pgfqpoint{0.1cm}{1.258cm}}{\pgfqpoint{0.137cm}{1.258cm}}
\pgfpathcurveto{\pgfqpoint{0.173cm}{1.258cm}}{\pgfqpoint{0.207cm}{1.273cm}}{\pgfqpoint{0.233cm}{1.299cm}}
\pgfpathcurveto{\pgfqpoint{0.259cm}{1.324cm}}{\pgfqpoint{0.273cm}{1.359cm}}{\pgfqpoint{0.273cm}{1.395cm}}
\pgfusepath{fill}
\begin{pgfscope}
\pgfsetdash{}{0cm}
\pgfsetlinewidth{0.818mm}
\pgfsetmiterlimit{7.0}
\pgfpathmoveto{\pgfqpoint{0.682cm}{0.671cm}}
\pgfpathlineto{\pgfqpoint{0.679cm}{1.418cm}}
\pgfusepath{stroke}
\end{pgfscope}
\pgfpathmoveto{\pgfqpoint{0.815cm}{1.399cm}}
\pgfpathcurveto{\pgfqpoint{0.815cm}{1.435cm}}{\pgfqpoint{0.801cm}{1.47cm}}{\pgfqpoint{0.775cm}{1.496cm}}
\pgfpathcurveto{\pgfqpoint{0.75cm}{1.521cm}}{\pgfqpoint{0.715cm}{1.536cm}}{\pgfqpoint{0.679cm}{1.536cm}}
\pgfpathcurveto{\pgfqpoint{0.643cm}{1.536cm}}{\pgfqpoint{0.608cm}{1.521cm}}{\pgfqpoint{0.582cm}{1.496cm}}
\pgfpathcurveto{\pgfqpoint{0.557cm}{1.47cm}}{\pgfqpoint{0.542cm}{1.435cm}}{\pgfqpoint{0.542cm}{1.399cm}}
\pgfpathcurveto{\pgfqpoint{0.542cm}{1.363cm}}{\pgfqpoint{0.557cm}{1.328cm}}{\pgfqpoint{0.582cm}{1.302cm}}
\pgfpathcurveto{\pgfqpoint{0.608cm}{1.276cm}}{\pgfqpoint{0.643cm}{1.262cm}}{\pgfqpoint{0.679cm}{1.262cm}}
\pgfpathcurveto{\pgfqpoint{0.715cm}{1.262cm}}{\pgfqpoint{0.75cm}{1.276cm}}{\pgfqpoint{0.775cm}{1.302cm}}
\pgfpathcurveto{\pgfqpoint{0.801cm}{1.328cm}}{\pgfqpoint{0.815cm}{1.363cm}}{\pgfqpoint{0.815cm}{1.399cm}}
\pgfusepath{fill}
\pgfpathmoveto{\pgfqpoint{1.345cm}{1.371cm}}
\pgfpathcurveto{\pgfqpoint{1.345cm}{1.408cm}}{\pgfqpoint{1.331cm}{1.442cm}}{\pgfqpoint{1.305cm}{1.468cm}}
\pgfpathcurveto{\pgfqpoint{1.28cm}{1.494cm}}{\pgfqpoint{1.245cm}{1.508cm}}{\pgfqpoint{1.209cm}{1.508cm}}
\pgfpathcurveto{\pgfqpoint{1.172cm}{1.508cm}}{\pgfqpoint{1.138cm}{1.494cm}}{\pgfqpoint{1.112cm}{1.468cm}}
\pgfpathcurveto{\pgfqpoint{1.087cm}{1.442cm}}{\pgfqpoint{1.072cm}{1.408cm}}{\pgfqpoint{1.072cm}{1.371cm}}
\pgfpathcurveto{\pgfqpoint{1.072cm}{1.335cm}}{\pgfqpoint{1.087cm}{1.3cm}}{\pgfqpoint{1.112cm}{1.274cm}}
\pgfpathcurveto{\pgfqpoint{1.138cm}{1.249cm}}{\pgfqpoint{1.172cm}{1.234cm}}{\pgfqpoint{1.209cm}{1.234cm}}
\pgfpathcurveto{\pgfqpoint{1.245cm}{1.234cm}}{\pgfqpoint{1.28cm}{1.249cm}}{\pgfqpoint{1.305cm}{1.274cm}}
\pgfpathcurveto{\pgfqpoint{1.331cm}{1.3cm}}{\pgfqpoint{1.345cm}{1.335cm}}{\pgfqpoint{1.345cm}{1.371cm}}
\pgfusepath{fill}
\begin{pgfscope}
\pgfsetdash{}{0cm}
\pgfsetlinewidth{0.818mm}
\pgfsetroundcap
\pgfsetmiterlimit{4.0}
\pgfpathmoveto{\pgfqpoint{0.682cm}{0.671cm}}
\pgfpathlineto{\pgfqpoint{0.682cm}{0.042cm}}
\pgfusepath{stroke}
\end{pgfscope}
\end{pgfscope}
\end{pgfscope}
\end{pgfscope}
\end{tikzpicture}}} = \llbracket X^3 \rrbracket, \quad \Q
   X^{\!\resizebox{0.6em}{!}{
\begin{tikzpicture}
\pgfpathmoveto{\pgfqpoint{0cm}{0cm}}
\pgfpathlineto{\pgfqpoint{1.376cm}{0cm}}
\pgfpathlineto{\pgfqpoint{1.376cm}{1.588cm}}
\pgfpathlineto{\pgfqpoint{0cm}{1.588cm}}
\pgfpathclose
\pgfusepath{clip}
\begin{pgfscope}
\begin{pgfscope}
\pgfpathmoveto{\pgfqpoint{0cm}{0cm}}
\pgfpathlineto{\pgfqpoint{1.376cm}{0cm}}
\pgfpathlineto{\pgfqpoint{1.376cm}{1.588cm}}
\pgfpathlineto{\pgfqpoint{0cm}{1.588cm}}
\pgfpathclose
\pgfusepath{clip}
\begin{pgfscope}
\begin{pgfscope}
\definecolor{eps2pgf_color}{gray}{0.976471}\pgfsetstrokecolor{eps2pgf_color}\pgfsetfillcolor{eps2pgf_color}
\pgfpathmoveto{\pgfqpoint{0cm}{0cm}}
\pgfpathlineto{\pgfqpoint{1.376cm}{0cm}}
\pgfpathlineto{\pgfqpoint{1.376cm}{1.588cm}}
\pgfpathlineto{\pgfqpoint{0cm}{1.588cm}}
\pgfpathclose
\pgfusepath{fill}
\end{pgfscope}
\begin{pgfscope}
\pgfsetdash{}{0cm}
\pgfsetlinewidth{0.818mm}
\pgfsetroundcap
\pgfsetroundjoin
\pgfsetmiterlimit{7.0}
\definecolor{eps2pgf_color}{gray}{0}\pgfsetstrokecolor{eps2pgf_color}\pgfsetfillcolor{eps2pgf_color}
\pgfpathmoveto{\pgfqpoint{0.117cm}{1.476cm}}
\pgfpathlineto{\pgfqpoint{0.682cm}{0.726cm}}
\pgfpathlineto{\pgfqpoint{1.246cm}{1.476cm}}
\pgfusepath{stroke}
\end{pgfscope}
\definecolor{eps2pgf_color}{gray}{0}\pgfsetstrokecolor{eps2pgf_color}\pgfsetfillcolor{eps2pgf_color}
\pgfpathmoveto{\pgfqpoint{0.273cm}{1.451cm}}
\pgfpathcurveto{\pgfqpoint{0.273cm}{1.487cm}}{\pgfqpoint{0.259cm}{1.522cm}}{\pgfqpoint{0.233cm}{1.547cm}}
\pgfpathcurveto{\pgfqpoint{0.207cm}{1.573cm}}{\pgfqpoint{0.173cm}{1.588cm}}{\pgfqpoint{0.137cm}{1.588cm}}
\pgfpathcurveto{\pgfqpoint{0.1cm}{1.588cm}}{\pgfqpoint{0.066cm}{1.573cm}}{\pgfqpoint{0.04cm}{1.547cm}}
\pgfpathcurveto{\pgfqpoint{0.014cm}{1.522cm}}{\pgfqpoint{0cm}{1.487cm}}{\pgfqpoint{0cm}{1.451cm}}
\pgfpathcurveto{\pgfqpoint{0cm}{1.414cm}}{\pgfqpoint{0.014cm}{1.379cm}}{\pgfqpoint{0.04cm}{1.354cm}}
\pgfpathcurveto{\pgfqpoint{0.066cm}{1.328cm}}{\pgfqpoint{0.1cm}{1.314cm}}{\pgfqpoint{0.137cm}{1.314cm}}
\pgfpathcurveto{\pgfqpoint{0.173cm}{1.314cm}}{\pgfqpoint{0.207cm}{1.328cm}}{\pgfqpoint{0.233cm}{1.354cm}}
\pgfpathcurveto{\pgfqpoint{0.259cm}{1.379cm}}{\pgfqpoint{0.273cm}{1.414cm}}{\pgfqpoint{0.273cm}{1.451cm}}
\pgfusepath{fill}
\pgfpathmoveto{\pgfqpoint{1.345cm}{1.426cm}}
\pgfpathcurveto{\pgfqpoint{1.345cm}{1.463cm}}{\pgfqpoint{1.331cm}{1.497cm}}{\pgfqpoint{1.305cm}{1.523cm}}
\pgfpathcurveto{\pgfqpoint{1.28cm}{1.549cm}}{\pgfqpoint{1.245cm}{1.563cm}}{\pgfqpoint{1.209cm}{1.563cm}}
\pgfpathcurveto{\pgfqpoint{1.172cm}{1.563cm}}{\pgfqpoint{1.138cm}{1.549cm}}{\pgfqpoint{1.112cm}{1.523cm}}
\pgfpathcurveto{\pgfqpoint{1.087cm}{1.497cm}}{\pgfqpoint{1.072cm}{1.463cm}}{\pgfqpoint{1.072cm}{1.426cm}}
\pgfpathcurveto{\pgfqpoint{1.072cm}{1.39cm}}{\pgfqpoint{1.087cm}{1.355cm}}{\pgfqpoint{1.112cm}{1.329cm}}
\pgfpathcurveto{\pgfqpoint{1.138cm}{1.304cm}}{\pgfqpoint{1.172cm}{1.289cm}}{\pgfqpoint{1.209cm}{1.289cm}}
\pgfpathcurveto{\pgfqpoint{1.245cm}{1.289cm}}{\pgfqpoint{1.28cm}{1.304cm}}{\pgfqpoint{1.305cm}{1.329cm}}
\pgfpathcurveto{\pgfqpoint{1.331cm}{1.355cm}}{\pgfqpoint{1.345cm}{1.39cm}}{\pgfqpoint{1.345cm}{1.426cm}}
\pgfusepath{fill}
\begin{pgfscope}
\pgfsetdash{}{0cm}
\pgfsetlinewidth{0.818mm}
\pgfsetroundcap
\pgfsetmiterlimit{4.0}
\pgfpathmoveto{\pgfqpoint{0.682cm}{0.726cm}}
\pgfpathlineto{\pgfqpoint{0.682cm}{0.097cm}}
\pgfusepath{stroke}
\end{pgfscope}
\end{pgfscope}
\end{pgfscope}
\end{pgfscope}
\end{tikzpicture}}} = \llbracket X^2 \rrbracket, \]
\[X^{\!\resizebox{!}{.8em}{
\begin{tikzpicture}
\pgfpathmoveto{\pgfqpoint{0cm}{-0.035cm}}
\pgfpathlineto{\pgfqpoint{1.976cm}{-0.035cm}}
\pgfpathlineto{\pgfqpoint{1.976cm}{1.94cm}}
\pgfpathlineto{\pgfqpoint{0cm}{1.94cm}}
\pgfpathclose
\pgfusepath{clip}
\begin{pgfscope}
\begin{pgfscope}
\pgfpathmoveto{\pgfqpoint{0cm}{-0.035cm}}
\pgfpathlineto{\pgfqpoint{1.976cm}{-0.035cm}}
\pgfpathlineto{\pgfqpoint{1.976cm}{1.94cm}}
\pgfpathlineto{\pgfqpoint{0cm}{1.94cm}}
\pgfpathclose
\pgfusepath{clip}
\begin{pgfscope}
\begin{pgfscope}
\pgfsetdash{}{0cm}
\pgfsetlinewidth{0.818mm}
\pgfsetroundcap
\pgfsetroundjoin
\pgfsetmiterlimit{7.0}
\definecolor{eps2pgf_color}{gray}{0}\pgfsetstrokecolor{eps2pgf_color}\pgfsetfillcolor{eps2pgf_color}
\pgfpathmoveto{\pgfqpoint{0.117cm}{1.815cm}}
\pgfpathlineto{\pgfqpoint{0.682cm}{1.065cm}}
\pgfpathlineto{\pgfqpoint{1.246cm}{1.815cm}}
\pgfusepath{stroke}
\end{pgfscope}
\definecolor{eps2pgf_color}{gray}{0}\pgfsetstrokecolor{eps2pgf_color}\pgfsetfillcolor{eps2pgf_color}
\pgfpathmoveto{\pgfqpoint{0.273cm}{1.789cm}}
\pgfpathcurveto{\pgfqpoint{0.273cm}{1.825cm}}{\pgfqpoint{0.259cm}{1.86cm}}{\pgfqpoint{0.233cm}{1.886cm}}
\pgfpathcurveto{\pgfqpoint{0.207cm}{1.912cm}}{\pgfqpoint{0.173cm}{1.926cm}}{\pgfqpoint{0.137cm}{1.926cm}}
\pgfpathcurveto{\pgfqpoint{0.1cm}{1.926cm}}{\pgfqpoint{0.066cm}{1.912cm}}{\pgfqpoint{0.04cm}{1.886cm}}
\pgfpathcurveto{\pgfqpoint{0.014cm}{1.86cm}}{\pgfqpoint{0cm}{1.825cm}}{\pgfqpoint{0cm}{1.789cm}}
\pgfpathcurveto{\pgfqpoint{0cm}{1.753cm}}{\pgfqpoint{0.014cm}{1.718cm}}{\pgfqpoint{0.04cm}{1.692cm}}
\pgfpathcurveto{\pgfqpoint{0.066cm}{1.667cm}}{\pgfqpoint{0.1cm}{1.652cm}}{\pgfqpoint{0.137cm}{1.652cm}}
\pgfpathcurveto{\pgfqpoint{0.173cm}{1.652cm}}{\pgfqpoint{0.207cm}{1.667cm}}{\pgfqpoint{0.233cm}{1.692cm}}
\pgfpathcurveto{\pgfqpoint{0.259cm}{1.718cm}}{\pgfqpoint{0.273cm}{1.753cm}}{\pgfqpoint{0.273cm}{1.789cm}}
\pgfusepath{fill}
\begin{pgfscope}
\pgfsetdash{}{0cm}
\pgfsetlinewidth{0.818mm}
\pgfsetmiterlimit{7.0}
\pgfpathmoveto{\pgfqpoint{0.682cm}{1.065cm}}
\pgfpathlineto{\pgfqpoint{0.679cm}{1.812cm}}
\pgfusepath{stroke}
\end{pgfscope}
\pgfpathmoveto{\pgfqpoint{0.815cm}{1.793cm}}
\pgfpathcurveto{\pgfqpoint{0.815cm}{1.829cm}}{\pgfqpoint{0.801cm}{1.864cm}}{\pgfqpoint{0.775cm}{1.89cm}}
\pgfpathcurveto{\pgfqpoint{0.75cm}{1.915cm}}{\pgfqpoint{0.715cm}{1.93cm}}{\pgfqpoint{0.679cm}{1.93cm}}
\pgfpathcurveto{\pgfqpoint{0.643cm}{1.93cm}}{\pgfqpoint{0.608cm}{1.915cm}}{\pgfqpoint{0.582cm}{1.89cm}}
\pgfpathcurveto{\pgfqpoint{0.557cm}{1.864cm}}{\pgfqpoint{0.542cm}{1.829cm}}{\pgfqpoint{0.542cm}{1.793cm}}
\pgfpathcurveto{\pgfqpoint{0.542cm}{1.756cm}}{\pgfqpoint{0.557cm}{1.722cm}}{\pgfqpoint{0.582cm}{1.696cm}}
\pgfpathcurveto{\pgfqpoint{0.608cm}{1.67cm}}{\pgfqpoint{0.643cm}{1.656cm}}{\pgfqpoint{0.679cm}{1.656cm}}
\pgfpathcurveto{\pgfqpoint{0.715cm}{1.656cm}}{\pgfqpoint{0.75cm}{1.67cm}}{\pgfqpoint{0.775cm}{1.696cm}}
\pgfpathcurveto{\pgfqpoint{0.801cm}{1.722cm}}{\pgfqpoint{0.815cm}{1.756cm}}{\pgfqpoint{0.815cm}{1.793cm}}
\pgfusepath{fill}
\pgfpathmoveto{\pgfqpoint{1.345cm}{1.765cm}}
\pgfpathcurveto{\pgfqpoint{1.345cm}{1.801cm}}{\pgfqpoint{1.331cm}{1.836cm}}{\pgfqpoint{1.305cm}{1.862cm}}
\pgfpathcurveto{\pgfqpoint{1.28cm}{1.887cm}}{\pgfqpoint{1.245cm}{1.902cm}}{\pgfqpoint{1.209cm}{1.902cm}}
\pgfpathcurveto{\pgfqpoint{1.172cm}{1.902cm}}{\pgfqpoint{1.138cm}{1.887cm}}{\pgfqpoint{1.112cm}{1.862cm}}
\pgfpathcurveto{\pgfqpoint{1.087cm}{1.836cm}}{\pgfqpoint{1.072cm}{1.801cm}}{\pgfqpoint{1.072cm}{1.765cm}}
\pgfpathcurveto{\pgfqpoint{1.072cm}{1.728cm}}{\pgfqpoint{1.087cm}{1.694cm}}{\pgfqpoint{1.112cm}{1.668cm}}
\pgfpathcurveto{\pgfqpoint{1.138cm}{1.642cm}}{\pgfqpoint{1.172cm}{1.628cm}}{\pgfqpoint{1.209cm}{1.628cm}}
\pgfpathcurveto{\pgfqpoint{1.245cm}{1.628cm}}{\pgfqpoint{1.28cm}{1.642cm}}{\pgfqpoint{1.305cm}{1.668cm}}
\pgfpathcurveto{\pgfqpoint{1.331cm}{1.694cm}}{\pgfqpoint{1.345cm}{1.728cm}}{\pgfqpoint{1.345cm}{1.765cm}}
\pgfusepath{fill}
\begin{pgfscope}
\pgfsetdash{}{0cm}
\pgfsetlinewidth{0.818mm}
\pgfsetroundcap
\pgfsetroundjoin
\pgfsetmiterlimit{7.0}
\pgfpathmoveto{\pgfqpoint{0.682cm}{1.065cm}}
\pgfpathlineto{\pgfqpoint{1.246cm}{0.315cm}}
\pgfpathlineto{\pgfqpoint{1.811cm}{1.065cm}}
\pgfusepath{stroke}
\end{pgfscope}
\pgfpathmoveto{\pgfqpoint{1.948cm}{1.065cm}}
\pgfpathcurveto{\pgfqpoint{1.948cm}{1.101cm}}{\pgfqpoint{1.933cm}{1.136cm}}{\pgfqpoint{1.907cm}{1.162cm}}
\pgfpathcurveto{\pgfqpoint{1.882cm}{1.187cm}}{\pgfqpoint{1.847cm}{1.202cm}}{\pgfqpoint{1.811cm}{1.202cm}}
\pgfpathcurveto{\pgfqpoint{1.775cm}{1.202cm}}{\pgfqpoint{1.74cm}{1.187cm}}{\pgfqpoint{1.714cm}{1.162cm}}
\pgfpathcurveto{\pgfqpoint{1.689cm}{1.136cm}}{\pgfqpoint{1.674cm}{1.101cm}}{\pgfqpoint{1.674cm}{1.065cm}}
\pgfpathcurveto{\pgfqpoint{1.674cm}{1.029cm}}{\pgfqpoint{1.689cm}{0.994cm}}{\pgfqpoint{1.714cm}{0.968cm}}
\pgfpathcurveto{\pgfqpoint{1.74cm}{0.942cm}}{\pgfqpoint{1.775cm}{0.928cm}}{\pgfqpoint{1.811cm}{0.928cm}}
\pgfpathcurveto{\pgfqpoint{1.847cm}{0.928cm}}{\pgfqpoint{1.882cm}{0.942cm}}{\pgfqpoint{1.907cm}{0.968cm}}
\pgfpathcurveto{\pgfqpoint{1.933cm}{0.994cm}}{\pgfqpoint{1.948cm}{1.029cm}}{\pgfqpoint{1.948cm}{1.065cm}}
\pgfusepath{fill}
\begin{pgfscope}
\pgfsetdash{}{0cm}
\pgfsetlinewidth{0.818mm}
\pgfsetmiterlimit{4.0}
\pgfpathmoveto{\pgfqpoint{1.383cm}{0.178cm}}
\pgfpathcurveto{\pgfqpoint{1.383cm}{0.214cm}}{\pgfqpoint{1.369cm}{0.249cm}}{\pgfqpoint{1.343cm}{0.275cm}}
\pgfpathcurveto{\pgfqpoint{1.317cm}{0.3cm}}{\pgfqpoint{1.283cm}{0.315cm}}{\pgfqpoint{1.246cm}{0.315cm}}
\pgfpathcurveto{\pgfqpoint{1.21cm}{0.315cm}}{\pgfqpoint{1.175cm}{0.3cm}}{\pgfqpoint{1.15cm}{0.275cm}}
\pgfpathcurveto{\pgfqpoint{1.124cm}{0.249cm}}{\pgfqpoint{1.11cm}{0.214cm}}{\pgfqpoint{1.11cm}{0.178cm}}
\pgfpathcurveto{\pgfqpoint{1.11cm}{0.141cm}}{\pgfqpoint{1.124cm}{0.107cm}}{\pgfqpoint{1.15cm}{0.081cm}}
\pgfpathcurveto{\pgfqpoint{1.175cm}{0.055cm}}{\pgfqpoint{1.21cm}{0.041cm}}{\pgfqpoint{1.246cm}{0.041cm}}
\pgfpathcurveto{\pgfqpoint{1.283cm}{0.041cm}}{\pgfqpoint{1.317cm}{0.055cm}}{\pgfqpoint{1.343cm}{0.081cm}}
\pgfpathcurveto{\pgfqpoint{1.369cm}{0.107cm}}{\pgfqpoint{1.383cm}{0.141cm}}{\pgfqpoint{1.383cm}{0.178cm}}
\pgfusepath{stroke}
\end{pgfscope}
\end{pgfscope}
\end{pgfscope}
\end{pgfscope}
\end{tikzpicture}}}=X^{\!\resizebox{0.6em}{!}{
\begin{tikzpicture}
\pgfpathmoveto{\pgfqpoint{0cm}{-0.035cm}}
\pgfpathlineto{\pgfqpoint{1.376cm}{-0.035cm}}
\pgfpathlineto{\pgfqpoint{1.376cm}{1.552cm}}
\pgfpathlineto{\pgfqpoint{0cm}{1.552cm}}
\pgfpathclose
\pgfusepath{clip}
\begin{pgfscope}
\begin{pgfscope}
\pgfpathmoveto{\pgfqpoint{0cm}{-0.035cm}}
\pgfpathlineto{\pgfqpoint{1.376cm}{-0.035cm}}
\pgfpathlineto{\pgfqpoint{1.376cm}{1.552cm}}
\pgfpathlineto{\pgfqpoint{0cm}{1.552cm}}
\pgfpathclose
\pgfusepath{clip}
\begin{pgfscope}
\begin{pgfscope}
\pgfsetdash{}{0cm}
\pgfsetlinewidth{0.818mm}
\pgfsetroundcap
\pgfsetroundjoin
\pgfsetmiterlimit{7.0}
\definecolor{eps2pgf_color}{gray}{0}\pgfsetstrokecolor{eps2pgf_color}\pgfsetfillcolor{eps2pgf_color}
\pgfpathmoveto{\pgfqpoint{0.117cm}{1.421cm}}
\pgfpathlineto{\pgfqpoint{0.682cm}{0.671cm}}
\pgfpathlineto{\pgfqpoint{1.246cm}{1.421cm}}
\pgfusepath{stroke}
\end{pgfscope}
\definecolor{eps2pgf_color}{gray}{0}\pgfsetstrokecolor{eps2pgf_color}\pgfsetfillcolor{eps2pgf_color}
\pgfpathmoveto{\pgfqpoint{0.273cm}{1.395cm}}
\pgfpathcurveto{\pgfqpoint{0.273cm}{1.432cm}}{\pgfqpoint{0.259cm}{1.467cm}}{\pgfqpoint{0.233cm}{1.492cm}}
\pgfpathcurveto{\pgfqpoint{0.207cm}{1.518cm}}{\pgfqpoint{0.173cm}{1.532cm}}{\pgfqpoint{0.137cm}{1.532cm}}
\pgfpathcurveto{\pgfqpoint{0.1cm}{1.532cm}}{\pgfqpoint{0.066cm}{1.518cm}}{\pgfqpoint{0.04cm}{1.492cm}}
\pgfpathcurveto{\pgfqpoint{0.014cm}{1.467cm}}{\pgfqpoint{0cm}{1.432cm}}{\pgfqpoint{0cm}{1.395cm}}
\pgfpathcurveto{\pgfqpoint{0cm}{1.359cm}}{\pgfqpoint{0.014cm}{1.324cm}}{\pgfqpoint{0.04cm}{1.299cm}}
\pgfpathcurveto{\pgfqpoint{0.066cm}{1.273cm}}{\pgfqpoint{0.1cm}{1.258cm}}{\pgfqpoint{0.137cm}{1.258cm}}
\pgfpathcurveto{\pgfqpoint{0.173cm}{1.258cm}}{\pgfqpoint{0.207cm}{1.273cm}}{\pgfqpoint{0.233cm}{1.299cm}}
\pgfpathcurveto{\pgfqpoint{0.259cm}{1.324cm}}{\pgfqpoint{0.273cm}{1.359cm}}{\pgfqpoint{0.273cm}{1.395cm}}
\pgfusepath{fill}
\begin{pgfscope}
\pgfsetdash{}{0cm}
\pgfsetlinewidth{0.818mm}
\pgfsetmiterlimit{7.0}
\pgfpathmoveto{\pgfqpoint{0.682cm}{0.671cm}}
\pgfpathlineto{\pgfqpoint{0.679cm}{1.418cm}}
\pgfusepath{stroke}
\end{pgfscope}
\pgfpathmoveto{\pgfqpoint{0.815cm}{1.399cm}}
\pgfpathcurveto{\pgfqpoint{0.815cm}{1.435cm}}{\pgfqpoint{0.801cm}{1.47cm}}{\pgfqpoint{0.775cm}{1.496cm}}
\pgfpathcurveto{\pgfqpoint{0.75cm}{1.521cm}}{\pgfqpoint{0.715cm}{1.536cm}}{\pgfqpoint{0.679cm}{1.536cm}}
\pgfpathcurveto{\pgfqpoint{0.643cm}{1.536cm}}{\pgfqpoint{0.608cm}{1.521cm}}{\pgfqpoint{0.582cm}{1.496cm}}
\pgfpathcurveto{\pgfqpoint{0.557cm}{1.47cm}}{\pgfqpoint{0.542cm}{1.435cm}}{\pgfqpoint{0.542cm}{1.399cm}}
\pgfpathcurveto{\pgfqpoint{0.542cm}{1.363cm}}{\pgfqpoint{0.557cm}{1.328cm}}{\pgfqpoint{0.582cm}{1.302cm}}
\pgfpathcurveto{\pgfqpoint{0.608cm}{1.276cm}}{\pgfqpoint{0.643cm}{1.262cm}}{\pgfqpoint{0.679cm}{1.262cm}}
\pgfpathcurveto{\pgfqpoint{0.715cm}{1.262cm}}{\pgfqpoint{0.75cm}{1.276cm}}{\pgfqpoint{0.775cm}{1.302cm}}
\pgfpathcurveto{\pgfqpoint{0.801cm}{1.328cm}}{\pgfqpoint{0.815cm}{1.363cm}}{\pgfqpoint{0.815cm}{1.399cm}}
\pgfusepath{fill}
\pgfpathmoveto{\pgfqpoint{1.345cm}{1.371cm}}
\pgfpathcurveto{\pgfqpoint{1.345cm}{1.408cm}}{\pgfqpoint{1.331cm}{1.442cm}}{\pgfqpoint{1.305cm}{1.468cm}}
\pgfpathcurveto{\pgfqpoint{1.28cm}{1.494cm}}{\pgfqpoint{1.245cm}{1.508cm}}{\pgfqpoint{1.209cm}{1.508cm}}
\pgfpathcurveto{\pgfqpoint{1.172cm}{1.508cm}}{\pgfqpoint{1.138cm}{1.494cm}}{\pgfqpoint{1.112cm}{1.468cm}}
\pgfpathcurveto{\pgfqpoint{1.087cm}{1.442cm}}{\pgfqpoint{1.072cm}{1.408cm}}{\pgfqpoint{1.072cm}{1.371cm}}
\pgfpathcurveto{\pgfqpoint{1.072cm}{1.335cm}}{\pgfqpoint{1.087cm}{1.3cm}}{\pgfqpoint{1.112cm}{1.274cm}}
\pgfpathcurveto{\pgfqpoint{1.138cm}{1.249cm}}{\pgfqpoint{1.172cm}{1.234cm}}{\pgfqpoint{1.209cm}{1.234cm}}
\pgfpathcurveto{\pgfqpoint{1.245cm}{1.234cm}}{\pgfqpoint{1.28cm}{1.249cm}}{\pgfqpoint{1.305cm}{1.274cm}}
\pgfpathcurveto{\pgfqpoint{1.331cm}{1.3cm}}{\pgfqpoint{1.345cm}{1.335cm}}{\pgfqpoint{1.345cm}{1.371cm}}
\pgfusepath{fill}
\begin{pgfscope}
\pgfsetdash{}{0cm}
\pgfsetlinewidth{0.818mm}
\pgfsetroundcap
\pgfsetmiterlimit{4.0}
\pgfpathmoveto{\pgfqpoint{0.682cm}{0.671cm}}
\pgfpathlineto{\pgfqpoint{0.682cm}{0.042cm}}
\pgfusepath{stroke}
\end{pgfscope}
\end{pgfscope}
\end{pgfscope}
\end{pgfscope}
\end{tikzpicture}}}\circ X,\qquad X^{\!\resizebox{!}{.8em}{
\begin{tikzpicture}
\pgfpathmoveto{\pgfqpoint{0cm}{-0.035cm}}
\pgfpathlineto{\pgfqpoint{1.976cm}{-0.035cm}}
\pgfpathlineto{\pgfqpoint{1.976cm}{1.94cm}}
\pgfpathlineto{\pgfqpoint{0cm}{1.94cm}}
\pgfpathclose
\pgfusepath{clip}
\begin{pgfscope}
\begin{pgfscope}
\pgfpathmoveto{\pgfqpoint{0cm}{-0.035cm}}
\pgfpathlineto{\pgfqpoint{1.976cm}{-0.035cm}}
\pgfpathlineto{\pgfqpoint{1.976cm}{1.94cm}}
\pgfpathlineto{\pgfqpoint{0cm}{1.94cm}}
\pgfpathclose
\pgfusepath{clip}
\begin{pgfscope}
\begin{pgfscope}
\pgfsetdash{}{0cm}
\pgfsetlinewidth{0.818mm}
\pgfsetroundcap
\pgfsetroundjoin
\pgfsetmiterlimit{7.0}
\definecolor{eps2pgf_color}{gray}{0}\pgfsetstrokecolor{eps2pgf_color}\pgfsetfillcolor{eps2pgf_color}
\pgfpathmoveto{\pgfqpoint{0.117cm}{1.815cm}}
\pgfpathlineto{\pgfqpoint{0.682cm}{1.065cm}}
\pgfpathlineto{\pgfqpoint{1.246cm}{1.815cm}}
\pgfusepath{stroke}
\end{pgfscope}
\definecolor{eps2pgf_color}{gray}{0}\pgfsetstrokecolor{eps2pgf_color}\pgfsetfillcolor{eps2pgf_color}
\pgfpathmoveto{\pgfqpoint{0.273cm}{1.789cm}}
\pgfpathcurveto{\pgfqpoint{0.273cm}{1.825cm}}{\pgfqpoint{0.259cm}{1.86cm}}{\pgfqpoint{0.233cm}{1.886cm}}
\pgfpathcurveto{\pgfqpoint{0.207cm}{1.912cm}}{\pgfqpoint{0.173cm}{1.926cm}}{\pgfqpoint{0.137cm}{1.926cm}}
\pgfpathcurveto{\pgfqpoint{0.1cm}{1.926cm}}{\pgfqpoint{0.066cm}{1.912cm}}{\pgfqpoint{0.04cm}{1.886cm}}
\pgfpathcurveto{\pgfqpoint{0.014cm}{1.86cm}}{\pgfqpoint{0cm}{1.825cm}}{\pgfqpoint{0cm}{1.789cm}}
\pgfpathcurveto{\pgfqpoint{0cm}{1.753cm}}{\pgfqpoint{0.014cm}{1.718cm}}{\pgfqpoint{0.04cm}{1.692cm}}
\pgfpathcurveto{\pgfqpoint{0.066cm}{1.667cm}}{\pgfqpoint{0.1cm}{1.652cm}}{\pgfqpoint{0.137cm}{1.652cm}}
\pgfpathcurveto{\pgfqpoint{0.173cm}{1.652cm}}{\pgfqpoint{0.207cm}{1.667cm}}{\pgfqpoint{0.233cm}{1.692cm}}
\pgfpathcurveto{\pgfqpoint{0.259cm}{1.718cm}}{\pgfqpoint{0.273cm}{1.753cm}}{\pgfqpoint{0.273cm}{1.789cm}}
\pgfusepath{fill}
\pgfpathmoveto{\pgfqpoint{1.345cm}{1.765cm}}
\pgfpathcurveto{\pgfqpoint{1.345cm}{1.801cm}}{\pgfqpoint{1.331cm}{1.836cm}}{\pgfqpoint{1.305cm}{1.862cm}}
\pgfpathcurveto{\pgfqpoint{1.28cm}{1.887cm}}{\pgfqpoint{1.245cm}{1.902cm}}{\pgfqpoint{1.209cm}{1.902cm}}
\pgfpathcurveto{\pgfqpoint{1.172cm}{1.902cm}}{\pgfqpoint{1.138cm}{1.887cm}}{\pgfqpoint{1.112cm}{1.862cm}}
\pgfpathcurveto{\pgfqpoint{1.087cm}{1.836cm}}{\pgfqpoint{1.072cm}{1.801cm}}{\pgfqpoint{1.072cm}{1.765cm}}
\pgfpathcurveto{\pgfqpoint{1.072cm}{1.728cm}}{\pgfqpoint{1.087cm}{1.694cm}}{\pgfqpoint{1.112cm}{1.668cm}}
\pgfpathcurveto{\pgfqpoint{1.138cm}{1.642cm}}{\pgfqpoint{1.172cm}{1.628cm}}{\pgfqpoint{1.209cm}{1.628cm}}
\pgfpathcurveto{\pgfqpoint{1.245cm}{1.628cm}}{\pgfqpoint{1.28cm}{1.642cm}}{\pgfqpoint{1.305cm}{1.668cm}}
\pgfpathcurveto{\pgfqpoint{1.331cm}{1.694cm}}{\pgfqpoint{1.345cm}{1.728cm}}{\pgfqpoint{1.345cm}{1.765cm}}
\pgfusepath{fill}
\begin{pgfscope}
\pgfsetdash{}{0cm}
\pgfsetlinewidth{0.818mm}
\pgfsetroundcap
\pgfsetroundjoin
\pgfsetmiterlimit{7.0}
\pgfpathmoveto{\pgfqpoint{0.682cm}{1.065cm}}
\pgfpathlineto{\pgfqpoint{1.246cm}{0.315cm}}
\pgfpathlineto{\pgfqpoint{1.811cm}{1.065cm}}
\pgfusepath{stroke}
\end{pgfscope}
\pgfpathmoveto{\pgfqpoint{1.948cm}{1.065cm}}
\pgfpathcurveto{\pgfqpoint{1.948cm}{1.101cm}}{\pgfqpoint{1.933cm}{1.136cm}}{\pgfqpoint{1.907cm}{1.162cm}}
\pgfpathcurveto{\pgfqpoint{1.882cm}{1.187cm}}{\pgfqpoint{1.847cm}{1.202cm}}{\pgfqpoint{1.811cm}{1.202cm}}
\pgfpathcurveto{\pgfqpoint{1.775cm}{1.202cm}}{\pgfqpoint{1.74cm}{1.187cm}}{\pgfqpoint{1.714cm}{1.162cm}}
\pgfpathcurveto{\pgfqpoint{1.689cm}{1.136cm}}{\pgfqpoint{1.674cm}{1.101cm}}{\pgfqpoint{1.674cm}{1.065cm}}
\pgfpathcurveto{\pgfqpoint{1.674cm}{1.029cm}}{\pgfqpoint{1.689cm}{0.994cm}}{\pgfqpoint{1.714cm}{0.968cm}}
\pgfpathcurveto{\pgfqpoint{1.74cm}{0.942cm}}{\pgfqpoint{1.775cm}{0.928cm}}{\pgfqpoint{1.811cm}{0.928cm}}
\pgfpathcurveto{\pgfqpoint{1.847cm}{0.928cm}}{\pgfqpoint{1.882cm}{0.942cm}}{\pgfqpoint{1.907cm}{0.968cm}}
\pgfpathcurveto{\pgfqpoint{1.933cm}{0.994cm}}{\pgfqpoint{1.948cm}{1.029cm}}{\pgfqpoint{1.948cm}{1.065cm}}
\pgfusepath{fill}
\begin{pgfscope}
\pgfsetdash{}{0cm}
\pgfsetlinewidth{0.818mm}
\pgfsetmiterlimit{7.0}
\pgfpathmoveto{\pgfqpoint{1.246cm}{0.315cm}}
\pgfpathlineto{\pgfqpoint{1.244cm}{1.061cm}}
\pgfusepath{stroke}
\end{pgfscope}
\pgfpathmoveto{\pgfqpoint{1.38cm}{1.065cm}}
\pgfpathcurveto{\pgfqpoint{1.38cm}{1.101cm}}{\pgfqpoint{1.366cm}{1.136cm}}{\pgfqpoint{1.34cm}{1.162cm}}
\pgfpathcurveto{\pgfqpoint{1.315cm}{1.187cm}}{\pgfqpoint{1.28cm}{1.202cm}}{\pgfqpoint{1.244cm}{1.202cm}}
\pgfpathcurveto{\pgfqpoint{1.207cm}{1.202cm}}{\pgfqpoint{1.173cm}{1.187cm}}{\pgfqpoint{1.147cm}{1.162cm}}
\pgfpathcurveto{\pgfqpoint{1.121cm}{1.136cm}}{\pgfqpoint{1.107cm}{1.101cm}}{\pgfqpoint{1.107cm}{1.065cm}}
\pgfpathcurveto{\pgfqpoint{1.107cm}{1.029cm}}{\pgfqpoint{1.121cm}{0.994cm}}{\pgfqpoint{1.147cm}{0.968cm}}
\pgfpathcurveto{\pgfqpoint{1.173cm}{0.942cm}}{\pgfqpoint{1.207cm}{0.928cm}}{\pgfqpoint{1.244cm}{0.928cm}}
\pgfpathcurveto{\pgfqpoint{1.28cm}{0.928cm}}{\pgfqpoint{1.315cm}{0.942cm}}{\pgfqpoint{1.34cm}{0.968cm}}
\pgfpathcurveto{\pgfqpoint{1.366cm}{0.994cm}}{\pgfqpoint{1.38cm}{1.029cm}}{\pgfqpoint{1.38cm}{1.065cm}}
\pgfusepath{fill}
\begin{pgfscope}
\pgfsetdash{}{0cm}
\pgfsetlinewidth{0.818mm}
\pgfsetmiterlimit{4.0}
\pgfpathmoveto{\pgfqpoint{1.383cm}{0.178cm}}
\pgfpathcurveto{\pgfqpoint{1.383cm}{0.214cm}}{\pgfqpoint{1.369cm}{0.249cm}}{\pgfqpoint{1.343cm}{0.275cm}}
\pgfpathcurveto{\pgfqpoint{1.317cm}{0.3cm}}{\pgfqpoint{1.283cm}{0.315cm}}{\pgfqpoint{1.246cm}{0.315cm}}
\pgfpathcurveto{\pgfqpoint{1.21cm}{0.315cm}}{\pgfqpoint{1.175cm}{0.3cm}}{\pgfqpoint{1.15cm}{0.275cm}}
\pgfpathcurveto{\pgfqpoint{1.124cm}{0.249cm}}{\pgfqpoint{1.11cm}{0.214cm}}{\pgfqpoint{1.11cm}{0.178cm}}
\pgfpathcurveto{\pgfqpoint{1.11cm}{0.141cm}}{\pgfqpoint{1.124cm}{0.107cm}}{\pgfqpoint{1.15cm}{0.081cm}}
\pgfpathcurveto{\pgfqpoint{1.175cm}{0.055cm}}{\pgfqpoint{1.21cm}{0.041cm}}{\pgfqpoint{1.246cm}{0.041cm}}
\pgfpathcurveto{\pgfqpoint{1.283cm}{0.041cm}}{\pgfqpoint{1.317cm}{0.055cm}}{\pgfqpoint{1.343cm}{0.081cm}}
\pgfpathcurveto{\pgfqpoint{1.369cm}{0.107cm}}{\pgfqpoint{1.383cm}{0.141cm}}{\pgfqpoint{1.383cm}{0.178cm}}
\pgfusepath{stroke}
\end{pgfscope}
\end{pgfscope}
\end{pgfscope}
\end{pgfscope}
\end{tikzpicture}}} = X^{\!\resizebox{0.6em}{!}{
\begin{tikzpicture}
\pgfpathmoveto{\pgfqpoint{0cm}{0cm}}
\pgfpathlineto{\pgfqpoint{1.376cm}{0cm}}
\pgfpathlineto{\pgfqpoint{1.376cm}{1.588cm}}
\pgfpathlineto{\pgfqpoint{0cm}{1.588cm}}
\pgfpathclose
\pgfusepath{clip}
\begin{pgfscope}
\begin{pgfscope}
\pgfpathmoveto{\pgfqpoint{0cm}{0cm}}
\pgfpathlineto{\pgfqpoint{1.376cm}{0cm}}
\pgfpathlineto{\pgfqpoint{1.376cm}{1.588cm}}
\pgfpathlineto{\pgfqpoint{0cm}{1.588cm}}
\pgfpathclose
\pgfusepath{clip}
\begin{pgfscope}
\begin{pgfscope}
\definecolor{eps2pgf_color}{gray}{0.976471}\pgfsetstrokecolor{eps2pgf_color}\pgfsetfillcolor{eps2pgf_color}
\pgfpathmoveto{\pgfqpoint{0cm}{0cm}}
\pgfpathlineto{\pgfqpoint{1.376cm}{0cm}}
\pgfpathlineto{\pgfqpoint{1.376cm}{1.588cm}}
\pgfpathlineto{\pgfqpoint{0cm}{1.588cm}}
\pgfpathclose
\pgfusepath{fill}
\end{pgfscope}
\begin{pgfscope}
\pgfsetdash{}{0cm}
\pgfsetlinewidth{0.818mm}
\pgfsetroundcap
\pgfsetroundjoin
\pgfsetmiterlimit{7.0}
\definecolor{eps2pgf_color}{gray}{0}\pgfsetstrokecolor{eps2pgf_color}\pgfsetfillcolor{eps2pgf_color}
\pgfpathmoveto{\pgfqpoint{0.117cm}{1.476cm}}
\pgfpathlineto{\pgfqpoint{0.682cm}{0.726cm}}
\pgfpathlineto{\pgfqpoint{1.246cm}{1.476cm}}
\pgfusepath{stroke}
\end{pgfscope}
\definecolor{eps2pgf_color}{gray}{0}\pgfsetstrokecolor{eps2pgf_color}\pgfsetfillcolor{eps2pgf_color}
\pgfpathmoveto{\pgfqpoint{0.273cm}{1.451cm}}
\pgfpathcurveto{\pgfqpoint{0.273cm}{1.487cm}}{\pgfqpoint{0.259cm}{1.522cm}}{\pgfqpoint{0.233cm}{1.547cm}}
\pgfpathcurveto{\pgfqpoint{0.207cm}{1.573cm}}{\pgfqpoint{0.173cm}{1.588cm}}{\pgfqpoint{0.137cm}{1.588cm}}
\pgfpathcurveto{\pgfqpoint{0.1cm}{1.588cm}}{\pgfqpoint{0.066cm}{1.573cm}}{\pgfqpoint{0.04cm}{1.547cm}}
\pgfpathcurveto{\pgfqpoint{0.014cm}{1.522cm}}{\pgfqpoint{0cm}{1.487cm}}{\pgfqpoint{0cm}{1.451cm}}
\pgfpathcurveto{\pgfqpoint{0cm}{1.414cm}}{\pgfqpoint{0.014cm}{1.379cm}}{\pgfqpoint{0.04cm}{1.354cm}}
\pgfpathcurveto{\pgfqpoint{0.066cm}{1.328cm}}{\pgfqpoint{0.1cm}{1.314cm}}{\pgfqpoint{0.137cm}{1.314cm}}
\pgfpathcurveto{\pgfqpoint{0.173cm}{1.314cm}}{\pgfqpoint{0.207cm}{1.328cm}}{\pgfqpoint{0.233cm}{1.354cm}}
\pgfpathcurveto{\pgfqpoint{0.259cm}{1.379cm}}{\pgfqpoint{0.273cm}{1.414cm}}{\pgfqpoint{0.273cm}{1.451cm}}
\pgfusepath{fill}
\pgfpathmoveto{\pgfqpoint{1.345cm}{1.426cm}}
\pgfpathcurveto{\pgfqpoint{1.345cm}{1.463cm}}{\pgfqpoint{1.331cm}{1.497cm}}{\pgfqpoint{1.305cm}{1.523cm}}
\pgfpathcurveto{\pgfqpoint{1.28cm}{1.549cm}}{\pgfqpoint{1.245cm}{1.563cm}}{\pgfqpoint{1.209cm}{1.563cm}}
\pgfpathcurveto{\pgfqpoint{1.172cm}{1.563cm}}{\pgfqpoint{1.138cm}{1.549cm}}{\pgfqpoint{1.112cm}{1.523cm}}
\pgfpathcurveto{\pgfqpoint{1.087cm}{1.497cm}}{\pgfqpoint{1.072cm}{1.463cm}}{\pgfqpoint{1.072cm}{1.426cm}}
\pgfpathcurveto{\pgfqpoint{1.072cm}{1.39cm}}{\pgfqpoint{1.087cm}{1.355cm}}{\pgfqpoint{1.112cm}{1.329cm}}
\pgfpathcurveto{\pgfqpoint{1.138cm}{1.304cm}}{\pgfqpoint{1.172cm}{1.289cm}}{\pgfqpoint{1.209cm}{1.289cm}}
\pgfpathcurveto{\pgfqpoint{1.245cm}{1.289cm}}{\pgfqpoint{1.28cm}{1.304cm}}{\pgfqpoint{1.305cm}{1.329cm}}
\pgfpathcurveto{\pgfqpoint{1.331cm}{1.355cm}}{\pgfqpoint{1.345cm}{1.39cm}}{\pgfqpoint{1.345cm}{1.426cm}}
\pgfusepath{fill}
\begin{pgfscope}
\pgfsetdash{}{0cm}
\pgfsetlinewidth{0.818mm}
\pgfsetroundcap
\pgfsetmiterlimit{4.0}
\pgfpathmoveto{\pgfqpoint{0.682cm}{0.726cm}}
\pgfpathlineto{\pgfqpoint{0.682cm}{0.097cm}}
\pgfusepath{stroke}
\end{pgfscope}
\end{pgfscope}
\end{pgfscope}
\end{pgfscope}
\end{tikzpicture}}} \circ \llbracket X^2 \rrbracket -\frac{b}{3}, \qquad
   X^{\!\resizebox{!}{.8em}{
\begin{tikzpicture}
\pgfpathmoveto{\pgfqpoint{0cm}{-0.035cm}}
\pgfpathlineto{\pgfqpoint{1.976cm}{-0.035cm}}
\pgfpathlineto{\pgfqpoint{1.976cm}{1.94cm}}
\pgfpathlineto{\pgfqpoint{0cm}{1.94cm}}
\pgfpathclose
\pgfusepath{clip}
\begin{pgfscope}
\begin{pgfscope}
\pgfpathmoveto{\pgfqpoint{0cm}{-0.035cm}}
\pgfpathlineto{\pgfqpoint{1.976cm}{-0.035cm}}
\pgfpathlineto{\pgfqpoint{1.976cm}{1.94cm}}
\pgfpathlineto{\pgfqpoint{0cm}{1.94cm}}
\pgfpathclose
\pgfusepath{clip}
\begin{pgfscope}
\begin{pgfscope}
\pgfsetdash{}{0cm}
\pgfsetlinewidth{0.818mm}
\pgfsetroundcap
\pgfsetroundjoin
\pgfsetmiterlimit{7.0}
\definecolor{eps2pgf_color}{gray}{0}\pgfsetstrokecolor{eps2pgf_color}\pgfsetfillcolor{eps2pgf_color}
\pgfpathmoveto{\pgfqpoint{0.117cm}{1.815cm}}
\pgfpathlineto{\pgfqpoint{0.682cm}{1.065cm}}
\pgfpathlineto{\pgfqpoint{1.246cm}{1.815cm}}
\pgfusepath{stroke}
\end{pgfscope}
\definecolor{eps2pgf_color}{gray}{0}\pgfsetstrokecolor{eps2pgf_color}\pgfsetfillcolor{eps2pgf_color}
\pgfpathmoveto{\pgfqpoint{0.273cm}{1.789cm}}
\pgfpathcurveto{\pgfqpoint{0.273cm}{1.825cm}}{\pgfqpoint{0.259cm}{1.86cm}}{\pgfqpoint{0.233cm}{1.886cm}}
\pgfpathcurveto{\pgfqpoint{0.207cm}{1.912cm}}{\pgfqpoint{0.173cm}{1.926cm}}{\pgfqpoint{0.137cm}{1.926cm}}
\pgfpathcurveto{\pgfqpoint{0.1cm}{1.926cm}}{\pgfqpoint{0.066cm}{1.912cm}}{\pgfqpoint{0.04cm}{1.886cm}}
\pgfpathcurveto{\pgfqpoint{0.014cm}{1.86cm}}{\pgfqpoint{0cm}{1.825cm}}{\pgfqpoint{0cm}{1.789cm}}
\pgfpathcurveto{\pgfqpoint{0cm}{1.753cm}}{\pgfqpoint{0.014cm}{1.718cm}}{\pgfqpoint{0.04cm}{1.692cm}}
\pgfpathcurveto{\pgfqpoint{0.066cm}{1.667cm}}{\pgfqpoint{0.1cm}{1.652cm}}{\pgfqpoint{0.137cm}{1.652cm}}
\pgfpathcurveto{\pgfqpoint{0.173cm}{1.652cm}}{\pgfqpoint{0.207cm}{1.667cm}}{\pgfqpoint{0.233cm}{1.692cm}}
\pgfpathcurveto{\pgfqpoint{0.259cm}{1.718cm}}{\pgfqpoint{0.273cm}{1.753cm}}{\pgfqpoint{0.273cm}{1.789cm}}
\pgfusepath{fill}
\begin{pgfscope}
\pgfsetdash{}{0cm}
\pgfsetlinewidth{0.818mm}
\pgfsetmiterlimit{7.0}
\pgfpathmoveto{\pgfqpoint{0.682cm}{1.065cm}}
\pgfpathlineto{\pgfqpoint{0.679cm}{1.812cm}}
\pgfusepath{stroke}
\end{pgfscope}
\pgfpathmoveto{\pgfqpoint{0.815cm}{1.793cm}}
\pgfpathcurveto{\pgfqpoint{0.815cm}{1.829cm}}{\pgfqpoint{0.801cm}{1.864cm}}{\pgfqpoint{0.775cm}{1.89cm}}
\pgfpathcurveto{\pgfqpoint{0.75cm}{1.915cm}}{\pgfqpoint{0.715cm}{1.93cm}}{\pgfqpoint{0.679cm}{1.93cm}}
\pgfpathcurveto{\pgfqpoint{0.643cm}{1.93cm}}{\pgfqpoint{0.608cm}{1.915cm}}{\pgfqpoint{0.582cm}{1.89cm}}
\pgfpathcurveto{\pgfqpoint{0.557cm}{1.864cm}}{\pgfqpoint{0.542cm}{1.829cm}}{\pgfqpoint{0.542cm}{1.793cm}}
\pgfpathcurveto{\pgfqpoint{0.542cm}{1.756cm}}{\pgfqpoint{0.557cm}{1.722cm}}{\pgfqpoint{0.582cm}{1.696cm}}
\pgfpathcurveto{\pgfqpoint{0.608cm}{1.67cm}}{\pgfqpoint{0.643cm}{1.656cm}}{\pgfqpoint{0.679cm}{1.656cm}}
\pgfpathcurveto{\pgfqpoint{0.715cm}{1.656cm}}{\pgfqpoint{0.75cm}{1.67cm}}{\pgfqpoint{0.775cm}{1.696cm}}
\pgfpathcurveto{\pgfqpoint{0.801cm}{1.722cm}}{\pgfqpoint{0.815cm}{1.756cm}}{\pgfqpoint{0.815cm}{1.793cm}}
\pgfusepath{fill}
\pgfpathmoveto{\pgfqpoint{1.345cm}{1.765cm}}
\pgfpathcurveto{\pgfqpoint{1.345cm}{1.801cm}}{\pgfqpoint{1.331cm}{1.836cm}}{\pgfqpoint{1.305cm}{1.862cm}}
\pgfpathcurveto{\pgfqpoint{1.28cm}{1.887cm}}{\pgfqpoint{1.245cm}{1.902cm}}{\pgfqpoint{1.209cm}{1.902cm}}
\pgfpathcurveto{\pgfqpoint{1.172cm}{1.902cm}}{\pgfqpoint{1.138cm}{1.887cm}}{\pgfqpoint{1.112cm}{1.862cm}}
\pgfpathcurveto{\pgfqpoint{1.087cm}{1.836cm}}{\pgfqpoint{1.072cm}{1.801cm}}{\pgfqpoint{1.072cm}{1.765cm}}
\pgfpathcurveto{\pgfqpoint{1.072cm}{1.728cm}}{\pgfqpoint{1.087cm}{1.694cm}}{\pgfqpoint{1.112cm}{1.668cm}}
\pgfpathcurveto{\pgfqpoint{1.138cm}{1.642cm}}{\pgfqpoint{1.172cm}{1.628cm}}{\pgfqpoint{1.209cm}{1.628cm}}
\pgfpathcurveto{\pgfqpoint{1.245cm}{1.628cm}}{\pgfqpoint{1.28cm}{1.642cm}}{\pgfqpoint{1.305cm}{1.668cm}}
\pgfpathcurveto{\pgfqpoint{1.331cm}{1.694cm}}{\pgfqpoint{1.345cm}{1.728cm}}{\pgfqpoint{1.345cm}{1.765cm}}
\pgfusepath{fill}
\begin{pgfscope}
\pgfsetdash{}{0cm}
\pgfsetlinewidth{0.818mm}
\pgfsetroundcap
\pgfsetroundjoin
\pgfsetmiterlimit{7.0}
\pgfpathmoveto{\pgfqpoint{0.682cm}{1.065cm}}
\pgfpathlineto{\pgfqpoint{1.246cm}{0.315cm}}
\pgfpathlineto{\pgfqpoint{1.811cm}{1.065cm}}
\pgfusepath{stroke}
\end{pgfscope}
\pgfpathmoveto{\pgfqpoint{1.948cm}{1.065cm}}
\pgfpathcurveto{\pgfqpoint{1.948cm}{1.101cm}}{\pgfqpoint{1.933cm}{1.136cm}}{\pgfqpoint{1.907cm}{1.162cm}}
\pgfpathcurveto{\pgfqpoint{1.882cm}{1.187cm}}{\pgfqpoint{1.847cm}{1.202cm}}{\pgfqpoint{1.811cm}{1.202cm}}
\pgfpathcurveto{\pgfqpoint{1.775cm}{1.202cm}}{\pgfqpoint{1.74cm}{1.187cm}}{\pgfqpoint{1.714cm}{1.162cm}}
\pgfpathcurveto{\pgfqpoint{1.689cm}{1.136cm}}{\pgfqpoint{1.674cm}{1.101cm}}{\pgfqpoint{1.674cm}{1.065cm}}
\pgfpathcurveto{\pgfqpoint{1.674cm}{1.029cm}}{\pgfqpoint{1.689cm}{0.994cm}}{\pgfqpoint{1.714cm}{0.968cm}}
\pgfpathcurveto{\pgfqpoint{1.74cm}{0.942cm}}{\pgfqpoint{1.775cm}{0.928cm}}{\pgfqpoint{1.811cm}{0.928cm}}
\pgfpathcurveto{\pgfqpoint{1.847cm}{0.928cm}}{\pgfqpoint{1.882cm}{0.942cm}}{\pgfqpoint{1.907cm}{0.968cm}}
\pgfpathcurveto{\pgfqpoint{1.933cm}{0.994cm}}{\pgfqpoint{1.948cm}{1.029cm}}{\pgfqpoint{1.948cm}{1.065cm}}
\pgfusepath{fill}
\begin{pgfscope}
\pgfsetdash{}{0cm}
\pgfsetlinewidth{0.818mm}
\pgfsetmiterlimit{7.0}
\pgfpathmoveto{\pgfqpoint{1.246cm}{0.315cm}}
\pgfpathlineto{\pgfqpoint{1.244cm}{1.061cm}}
\pgfusepath{stroke}
\end{pgfscope}
\pgfpathmoveto{\pgfqpoint{1.38cm}{1.065cm}}
\pgfpathcurveto{\pgfqpoint{1.38cm}{1.101cm}}{\pgfqpoint{1.366cm}{1.136cm}}{\pgfqpoint{1.34cm}{1.162cm}}
\pgfpathcurveto{\pgfqpoint{1.315cm}{1.187cm}}{\pgfqpoint{1.28cm}{1.202cm}}{\pgfqpoint{1.244cm}{1.202cm}}
\pgfpathcurveto{\pgfqpoint{1.207cm}{1.202cm}}{\pgfqpoint{1.173cm}{1.187cm}}{\pgfqpoint{1.147cm}{1.162cm}}
\pgfpathcurveto{\pgfqpoint{1.121cm}{1.136cm}}{\pgfqpoint{1.107cm}{1.101cm}}{\pgfqpoint{1.107cm}{1.065cm}}
\pgfpathcurveto{\pgfqpoint{1.107cm}{1.029cm}}{\pgfqpoint{1.121cm}{0.994cm}}{\pgfqpoint{1.147cm}{0.968cm}}
\pgfpathcurveto{\pgfqpoint{1.173cm}{0.942cm}}{\pgfqpoint{1.207cm}{0.928cm}}{\pgfqpoint{1.244cm}{0.928cm}}
\pgfpathcurveto{\pgfqpoint{1.28cm}{0.928cm}}{\pgfqpoint{1.315cm}{0.942cm}}{\pgfqpoint{1.34cm}{0.968cm}}
\pgfpathcurveto{\pgfqpoint{1.366cm}{0.994cm}}{\pgfqpoint{1.38cm}{1.029cm}}{\pgfqpoint{1.38cm}{1.065cm}}
\pgfusepath{fill}
\begin{pgfscope}
\pgfsetdash{}{0cm}
\pgfsetlinewidth{0.818mm}
\pgfsetmiterlimit{4.0}
\pgfpathmoveto{\pgfqpoint{1.383cm}{0.178cm}}
\pgfpathcurveto{\pgfqpoint{1.383cm}{0.214cm}}{\pgfqpoint{1.369cm}{0.249cm}}{\pgfqpoint{1.343cm}{0.275cm}}
\pgfpathcurveto{\pgfqpoint{1.317cm}{0.3cm}}{\pgfqpoint{1.283cm}{0.315cm}}{\pgfqpoint{1.246cm}{0.315cm}}
\pgfpathcurveto{\pgfqpoint{1.21cm}{0.315cm}}{\pgfqpoint{1.175cm}{0.3cm}}{\pgfqpoint{1.15cm}{0.275cm}}
\pgfpathcurveto{\pgfqpoint{1.124cm}{0.249cm}}{\pgfqpoint{1.11cm}{0.214cm}}{\pgfqpoint{1.11cm}{0.178cm}}
\pgfpathcurveto{\pgfqpoint{1.11cm}{0.141cm}}{\pgfqpoint{1.124cm}{0.107cm}}{\pgfqpoint{1.15cm}{0.081cm}}
\pgfpathcurveto{\pgfqpoint{1.175cm}{0.055cm}}{\pgfqpoint{1.21cm}{0.041cm}}{\pgfqpoint{1.246cm}{0.041cm}}
\pgfpathcurveto{\pgfqpoint{1.283cm}{0.041cm}}{\pgfqpoint{1.317cm}{0.055cm}}{\pgfqpoint{1.343cm}{0.081cm}}
\pgfpathcurveto{\pgfqpoint{1.369cm}{0.107cm}}{\pgfqpoint{1.383cm}{0.141cm}}{\pgfqpoint{1.383cm}{0.178cm}}
\pgfusepath{stroke}
\end{pgfscope}
\end{pgfscope}
\end{pgfscope}
\end{pgfscope}
\end{tikzpicture}}} = X^{\!\resizebox{0.6em}{!}{
\begin{tikzpicture}
\pgfpathmoveto{\pgfqpoint{0cm}{-0.035cm}}
\pgfpathlineto{\pgfqpoint{1.376cm}{-0.035cm}}
\pgfpathlineto{\pgfqpoint{1.376cm}{1.552cm}}
\pgfpathlineto{\pgfqpoint{0cm}{1.552cm}}
\pgfpathclose
\pgfusepath{clip}
\begin{pgfscope}
\begin{pgfscope}
\pgfpathmoveto{\pgfqpoint{0cm}{-0.035cm}}
\pgfpathlineto{\pgfqpoint{1.376cm}{-0.035cm}}
\pgfpathlineto{\pgfqpoint{1.376cm}{1.552cm}}
\pgfpathlineto{\pgfqpoint{0cm}{1.552cm}}
\pgfpathclose
\pgfusepath{clip}
\begin{pgfscope}
\begin{pgfscope}
\pgfsetdash{}{0cm}
\pgfsetlinewidth{0.818mm}
\pgfsetroundcap
\pgfsetroundjoin
\pgfsetmiterlimit{7.0}
\definecolor{eps2pgf_color}{gray}{0}\pgfsetstrokecolor{eps2pgf_color}\pgfsetfillcolor{eps2pgf_color}
\pgfpathmoveto{\pgfqpoint{0.117cm}{1.421cm}}
\pgfpathlineto{\pgfqpoint{0.682cm}{0.671cm}}
\pgfpathlineto{\pgfqpoint{1.246cm}{1.421cm}}
\pgfusepath{stroke}
\end{pgfscope}
\definecolor{eps2pgf_color}{gray}{0}\pgfsetstrokecolor{eps2pgf_color}\pgfsetfillcolor{eps2pgf_color}
\pgfpathmoveto{\pgfqpoint{0.273cm}{1.395cm}}
\pgfpathcurveto{\pgfqpoint{0.273cm}{1.432cm}}{\pgfqpoint{0.259cm}{1.467cm}}{\pgfqpoint{0.233cm}{1.492cm}}
\pgfpathcurveto{\pgfqpoint{0.207cm}{1.518cm}}{\pgfqpoint{0.173cm}{1.532cm}}{\pgfqpoint{0.137cm}{1.532cm}}
\pgfpathcurveto{\pgfqpoint{0.1cm}{1.532cm}}{\pgfqpoint{0.066cm}{1.518cm}}{\pgfqpoint{0.04cm}{1.492cm}}
\pgfpathcurveto{\pgfqpoint{0.014cm}{1.467cm}}{\pgfqpoint{0cm}{1.432cm}}{\pgfqpoint{0cm}{1.395cm}}
\pgfpathcurveto{\pgfqpoint{0cm}{1.359cm}}{\pgfqpoint{0.014cm}{1.324cm}}{\pgfqpoint{0.04cm}{1.299cm}}
\pgfpathcurveto{\pgfqpoint{0.066cm}{1.273cm}}{\pgfqpoint{0.1cm}{1.258cm}}{\pgfqpoint{0.137cm}{1.258cm}}
\pgfpathcurveto{\pgfqpoint{0.173cm}{1.258cm}}{\pgfqpoint{0.207cm}{1.273cm}}{\pgfqpoint{0.233cm}{1.299cm}}
\pgfpathcurveto{\pgfqpoint{0.259cm}{1.324cm}}{\pgfqpoint{0.273cm}{1.359cm}}{\pgfqpoint{0.273cm}{1.395cm}}
\pgfusepath{fill}
\begin{pgfscope}
\pgfsetdash{}{0cm}
\pgfsetlinewidth{0.818mm}
\pgfsetmiterlimit{7.0}
\pgfpathmoveto{\pgfqpoint{0.682cm}{0.671cm}}
\pgfpathlineto{\pgfqpoint{0.679cm}{1.418cm}}
\pgfusepath{stroke}
\end{pgfscope}
\pgfpathmoveto{\pgfqpoint{0.815cm}{1.399cm}}
\pgfpathcurveto{\pgfqpoint{0.815cm}{1.435cm}}{\pgfqpoint{0.801cm}{1.47cm}}{\pgfqpoint{0.775cm}{1.496cm}}
\pgfpathcurveto{\pgfqpoint{0.75cm}{1.521cm}}{\pgfqpoint{0.715cm}{1.536cm}}{\pgfqpoint{0.679cm}{1.536cm}}
\pgfpathcurveto{\pgfqpoint{0.643cm}{1.536cm}}{\pgfqpoint{0.608cm}{1.521cm}}{\pgfqpoint{0.582cm}{1.496cm}}
\pgfpathcurveto{\pgfqpoint{0.557cm}{1.47cm}}{\pgfqpoint{0.542cm}{1.435cm}}{\pgfqpoint{0.542cm}{1.399cm}}
\pgfpathcurveto{\pgfqpoint{0.542cm}{1.363cm}}{\pgfqpoint{0.557cm}{1.328cm}}{\pgfqpoint{0.582cm}{1.302cm}}
\pgfpathcurveto{\pgfqpoint{0.608cm}{1.276cm}}{\pgfqpoint{0.643cm}{1.262cm}}{\pgfqpoint{0.679cm}{1.262cm}}
\pgfpathcurveto{\pgfqpoint{0.715cm}{1.262cm}}{\pgfqpoint{0.75cm}{1.276cm}}{\pgfqpoint{0.775cm}{1.302cm}}
\pgfpathcurveto{\pgfqpoint{0.801cm}{1.328cm}}{\pgfqpoint{0.815cm}{1.363cm}}{\pgfqpoint{0.815cm}{1.399cm}}
\pgfusepath{fill}
\pgfpathmoveto{\pgfqpoint{1.345cm}{1.371cm}}
\pgfpathcurveto{\pgfqpoint{1.345cm}{1.408cm}}{\pgfqpoint{1.331cm}{1.442cm}}{\pgfqpoint{1.305cm}{1.468cm}}
\pgfpathcurveto{\pgfqpoint{1.28cm}{1.494cm}}{\pgfqpoint{1.245cm}{1.508cm}}{\pgfqpoint{1.209cm}{1.508cm}}
\pgfpathcurveto{\pgfqpoint{1.172cm}{1.508cm}}{\pgfqpoint{1.138cm}{1.494cm}}{\pgfqpoint{1.112cm}{1.468cm}}
\pgfpathcurveto{\pgfqpoint{1.087cm}{1.442cm}}{\pgfqpoint{1.072cm}{1.408cm}}{\pgfqpoint{1.072cm}{1.371cm}}
\pgfpathcurveto{\pgfqpoint{1.072cm}{1.335cm}}{\pgfqpoint{1.087cm}{1.3cm}}{\pgfqpoint{1.112cm}{1.274cm}}
\pgfpathcurveto{\pgfqpoint{1.138cm}{1.249cm}}{\pgfqpoint{1.172cm}{1.234cm}}{\pgfqpoint{1.209cm}{1.234cm}}
\pgfpathcurveto{\pgfqpoint{1.245cm}{1.234cm}}{\pgfqpoint{1.28cm}{1.249cm}}{\pgfqpoint{1.305cm}{1.274cm}}
\pgfpathcurveto{\pgfqpoint{1.331cm}{1.3cm}}{\pgfqpoint{1.345cm}{1.335cm}}{\pgfqpoint{1.345cm}{1.371cm}}
\pgfusepath{fill}
\begin{pgfscope}
\pgfsetdash{}{0cm}
\pgfsetlinewidth{0.818mm}
\pgfsetroundcap
\pgfsetmiterlimit{4.0}
\pgfpathmoveto{\pgfqpoint{0.682cm}{0.671cm}}
\pgfpathlineto{\pgfqpoint{0.682cm}{0.042cm}}
\pgfusepath{stroke}
\end{pgfscope}
\end{pgfscope}
\end{pgfscope}
\end{pgfscope}
\end{tikzpicture}}} \circ \llbracket X^2 \rrbracket -  bX. \]

Recall that if $\rho$ is a polynomial weight of the form $\rho(x)=\langle x\rangle^{-\nu}$ for some   $\nu>0$ and $\sigma>0$ then these objects can be constructed in spaces $\CC^{\alpha}(\rho^\sigma)$ where the respective values of $\alpha$ are given in  Table~\ref{t:reg}. The parameter $\kappa>0$ can be chosen arbitrarily small. 

As a consequence, the left hand side of \eqref{eq:45} rewrites as
\begin{align}\label{eq:rhs45}
\begin{aligned}
&\Q \varphi + \varphi^3 + (- 3 a + 3b) \varphi - \xi \\
&= \Q\phi+\Q\psi +3\llbracket X^2 \rrbracket(-X^{\!\resizebox{0.6em}{!}{
\begin{tikzpicture}
\pgfpathmoveto{\pgfqpoint{0cm}{-0.035cm}}
\pgfpathlineto{\pgfqpoint{1.376cm}{-0.035cm}}
\pgfpathlineto{\pgfqpoint{1.376cm}{1.552cm}}
\pgfpathlineto{\pgfqpoint{0cm}{1.552cm}}
\pgfpathclose
\pgfusepath{clip}
\begin{pgfscope}
\begin{pgfscope}
\pgfpathmoveto{\pgfqpoint{0cm}{-0.035cm}}
\pgfpathlineto{\pgfqpoint{1.376cm}{-0.035cm}}
\pgfpathlineto{\pgfqpoint{1.376cm}{1.552cm}}
\pgfpathlineto{\pgfqpoint{0cm}{1.552cm}}
\pgfpathclose
\pgfusepath{clip}
\begin{pgfscope}
\begin{pgfscope}
\pgfsetdash{}{0cm}
\pgfsetlinewidth{0.818mm}
\pgfsetroundcap
\pgfsetroundjoin
\pgfsetmiterlimit{7.0}
\definecolor{eps2pgf_color}{gray}{0}\pgfsetstrokecolor{eps2pgf_color}\pgfsetfillcolor{eps2pgf_color}
\pgfpathmoveto{\pgfqpoint{0.117cm}{1.421cm}}
\pgfpathlineto{\pgfqpoint{0.682cm}{0.671cm}}
\pgfpathlineto{\pgfqpoint{1.246cm}{1.421cm}}
\pgfusepath{stroke}
\end{pgfscope}
\definecolor{eps2pgf_color}{gray}{0}\pgfsetstrokecolor{eps2pgf_color}\pgfsetfillcolor{eps2pgf_color}
\pgfpathmoveto{\pgfqpoint{0.273cm}{1.395cm}}
\pgfpathcurveto{\pgfqpoint{0.273cm}{1.432cm}}{\pgfqpoint{0.259cm}{1.467cm}}{\pgfqpoint{0.233cm}{1.492cm}}
\pgfpathcurveto{\pgfqpoint{0.207cm}{1.518cm}}{\pgfqpoint{0.173cm}{1.532cm}}{\pgfqpoint{0.137cm}{1.532cm}}
\pgfpathcurveto{\pgfqpoint{0.1cm}{1.532cm}}{\pgfqpoint{0.066cm}{1.518cm}}{\pgfqpoint{0.04cm}{1.492cm}}
\pgfpathcurveto{\pgfqpoint{0.014cm}{1.467cm}}{\pgfqpoint{0cm}{1.432cm}}{\pgfqpoint{0cm}{1.395cm}}
\pgfpathcurveto{\pgfqpoint{0cm}{1.359cm}}{\pgfqpoint{0.014cm}{1.324cm}}{\pgfqpoint{0.04cm}{1.299cm}}
\pgfpathcurveto{\pgfqpoint{0.066cm}{1.273cm}}{\pgfqpoint{0.1cm}{1.258cm}}{\pgfqpoint{0.137cm}{1.258cm}}
\pgfpathcurveto{\pgfqpoint{0.173cm}{1.258cm}}{\pgfqpoint{0.207cm}{1.273cm}}{\pgfqpoint{0.233cm}{1.299cm}}
\pgfpathcurveto{\pgfqpoint{0.259cm}{1.324cm}}{\pgfqpoint{0.273cm}{1.359cm}}{\pgfqpoint{0.273cm}{1.395cm}}
\pgfusepath{fill}
\begin{pgfscope}
\pgfsetdash{}{0cm}
\pgfsetlinewidth{0.818mm}
\pgfsetmiterlimit{7.0}
\pgfpathmoveto{\pgfqpoint{0.682cm}{0.671cm}}
\pgfpathlineto{\pgfqpoint{0.679cm}{1.418cm}}
\pgfusepath{stroke}
\end{pgfscope}
\pgfpathmoveto{\pgfqpoint{0.815cm}{1.399cm}}
\pgfpathcurveto{\pgfqpoint{0.815cm}{1.435cm}}{\pgfqpoint{0.801cm}{1.47cm}}{\pgfqpoint{0.775cm}{1.496cm}}
\pgfpathcurveto{\pgfqpoint{0.75cm}{1.521cm}}{\pgfqpoint{0.715cm}{1.536cm}}{\pgfqpoint{0.679cm}{1.536cm}}
\pgfpathcurveto{\pgfqpoint{0.643cm}{1.536cm}}{\pgfqpoint{0.608cm}{1.521cm}}{\pgfqpoint{0.582cm}{1.496cm}}
\pgfpathcurveto{\pgfqpoint{0.557cm}{1.47cm}}{\pgfqpoint{0.542cm}{1.435cm}}{\pgfqpoint{0.542cm}{1.399cm}}
\pgfpathcurveto{\pgfqpoint{0.542cm}{1.363cm}}{\pgfqpoint{0.557cm}{1.328cm}}{\pgfqpoint{0.582cm}{1.302cm}}
\pgfpathcurveto{\pgfqpoint{0.608cm}{1.276cm}}{\pgfqpoint{0.643cm}{1.262cm}}{\pgfqpoint{0.679cm}{1.262cm}}
\pgfpathcurveto{\pgfqpoint{0.715cm}{1.262cm}}{\pgfqpoint{0.75cm}{1.276cm}}{\pgfqpoint{0.775cm}{1.302cm}}
\pgfpathcurveto{\pgfqpoint{0.801cm}{1.328cm}}{\pgfqpoint{0.815cm}{1.363cm}}{\pgfqpoint{0.815cm}{1.399cm}}
\pgfusepath{fill}
\pgfpathmoveto{\pgfqpoint{1.345cm}{1.371cm}}
\pgfpathcurveto{\pgfqpoint{1.345cm}{1.408cm}}{\pgfqpoint{1.331cm}{1.442cm}}{\pgfqpoint{1.305cm}{1.468cm}}
\pgfpathcurveto{\pgfqpoint{1.28cm}{1.494cm}}{\pgfqpoint{1.245cm}{1.508cm}}{\pgfqpoint{1.209cm}{1.508cm}}
\pgfpathcurveto{\pgfqpoint{1.172cm}{1.508cm}}{\pgfqpoint{1.138cm}{1.494cm}}{\pgfqpoint{1.112cm}{1.468cm}}
\pgfpathcurveto{\pgfqpoint{1.087cm}{1.442cm}}{\pgfqpoint{1.072cm}{1.408cm}}{\pgfqpoint{1.072cm}{1.371cm}}
\pgfpathcurveto{\pgfqpoint{1.072cm}{1.335cm}}{\pgfqpoint{1.087cm}{1.3cm}}{\pgfqpoint{1.112cm}{1.274cm}}
\pgfpathcurveto{\pgfqpoint{1.138cm}{1.249cm}}{\pgfqpoint{1.172cm}{1.234cm}}{\pgfqpoint{1.209cm}{1.234cm}}
\pgfpathcurveto{\pgfqpoint{1.245cm}{1.234cm}}{\pgfqpoint{1.28cm}{1.249cm}}{\pgfqpoint{1.305cm}{1.274cm}}
\pgfpathcurveto{\pgfqpoint{1.331cm}{1.3cm}}{\pgfqpoint{1.345cm}{1.335cm}}{\pgfqpoint{1.345cm}{1.371cm}}
\pgfusepath{fill}
\begin{pgfscope}
\pgfsetdash{}{0cm}
\pgfsetlinewidth{0.818mm}
\pgfsetroundcap
\pgfsetmiterlimit{4.0}
\pgfpathmoveto{\pgfqpoint{0.682cm}{0.671cm}}
\pgfpathlineto{\pgfqpoint{0.682cm}{0.042cm}}
\pgfusepath{stroke}
\end{pgfscope}
\end{pgfscope}
\end{pgfscope}
\end{pgfscope}
\end{tikzpicture}}}+\phi+\psi)+3X(-X^{\!\resizebox{0.6em}{!}{
\begin{tikzpicture}
\pgfpathmoveto{\pgfqpoint{0cm}{-0.035cm}}
\pgfpathlineto{\pgfqpoint{1.376cm}{-0.035cm}}
\pgfpathlineto{\pgfqpoint{1.376cm}{1.552cm}}
\pgfpathlineto{\pgfqpoint{0cm}{1.552cm}}
\pgfpathclose
\pgfusepath{clip}
\begin{pgfscope}
\begin{pgfscope}
\pgfpathmoveto{\pgfqpoint{0cm}{-0.035cm}}
\pgfpathlineto{\pgfqpoint{1.376cm}{-0.035cm}}
\pgfpathlineto{\pgfqpoint{1.376cm}{1.552cm}}
\pgfpathlineto{\pgfqpoint{0cm}{1.552cm}}
\pgfpathclose
\pgfusepath{clip}
\begin{pgfscope}
\begin{pgfscope}
\pgfsetdash{}{0cm}
\pgfsetlinewidth{0.818mm}
\pgfsetroundcap
\pgfsetroundjoin
\pgfsetmiterlimit{7.0}
\definecolor{eps2pgf_color}{gray}{0}\pgfsetstrokecolor{eps2pgf_color}\pgfsetfillcolor{eps2pgf_color}
\pgfpathmoveto{\pgfqpoint{0.117cm}{1.421cm}}
\pgfpathlineto{\pgfqpoint{0.682cm}{0.671cm}}
\pgfpathlineto{\pgfqpoint{1.246cm}{1.421cm}}
\pgfusepath{stroke}
\end{pgfscope}
\definecolor{eps2pgf_color}{gray}{0}\pgfsetstrokecolor{eps2pgf_color}\pgfsetfillcolor{eps2pgf_color}
\pgfpathmoveto{\pgfqpoint{0.273cm}{1.395cm}}
\pgfpathcurveto{\pgfqpoint{0.273cm}{1.432cm}}{\pgfqpoint{0.259cm}{1.467cm}}{\pgfqpoint{0.233cm}{1.492cm}}
\pgfpathcurveto{\pgfqpoint{0.207cm}{1.518cm}}{\pgfqpoint{0.173cm}{1.532cm}}{\pgfqpoint{0.137cm}{1.532cm}}
\pgfpathcurveto{\pgfqpoint{0.1cm}{1.532cm}}{\pgfqpoint{0.066cm}{1.518cm}}{\pgfqpoint{0.04cm}{1.492cm}}
\pgfpathcurveto{\pgfqpoint{0.014cm}{1.467cm}}{\pgfqpoint{0cm}{1.432cm}}{\pgfqpoint{0cm}{1.395cm}}
\pgfpathcurveto{\pgfqpoint{0cm}{1.359cm}}{\pgfqpoint{0.014cm}{1.324cm}}{\pgfqpoint{0.04cm}{1.299cm}}
\pgfpathcurveto{\pgfqpoint{0.066cm}{1.273cm}}{\pgfqpoint{0.1cm}{1.258cm}}{\pgfqpoint{0.137cm}{1.258cm}}
\pgfpathcurveto{\pgfqpoint{0.173cm}{1.258cm}}{\pgfqpoint{0.207cm}{1.273cm}}{\pgfqpoint{0.233cm}{1.299cm}}
\pgfpathcurveto{\pgfqpoint{0.259cm}{1.324cm}}{\pgfqpoint{0.273cm}{1.359cm}}{\pgfqpoint{0.273cm}{1.395cm}}
\pgfusepath{fill}
\begin{pgfscope}
\pgfsetdash{}{0cm}
\pgfsetlinewidth{0.818mm}
\pgfsetmiterlimit{7.0}
\pgfpathmoveto{\pgfqpoint{0.682cm}{0.671cm}}
\pgfpathlineto{\pgfqpoint{0.679cm}{1.418cm}}
\pgfusepath{stroke}
\end{pgfscope}
\pgfpathmoveto{\pgfqpoint{0.815cm}{1.399cm}}
\pgfpathcurveto{\pgfqpoint{0.815cm}{1.435cm}}{\pgfqpoint{0.801cm}{1.47cm}}{\pgfqpoint{0.775cm}{1.496cm}}
\pgfpathcurveto{\pgfqpoint{0.75cm}{1.521cm}}{\pgfqpoint{0.715cm}{1.536cm}}{\pgfqpoint{0.679cm}{1.536cm}}
\pgfpathcurveto{\pgfqpoint{0.643cm}{1.536cm}}{\pgfqpoint{0.608cm}{1.521cm}}{\pgfqpoint{0.582cm}{1.496cm}}
\pgfpathcurveto{\pgfqpoint{0.557cm}{1.47cm}}{\pgfqpoint{0.542cm}{1.435cm}}{\pgfqpoint{0.542cm}{1.399cm}}
\pgfpathcurveto{\pgfqpoint{0.542cm}{1.363cm}}{\pgfqpoint{0.557cm}{1.328cm}}{\pgfqpoint{0.582cm}{1.302cm}}
\pgfpathcurveto{\pgfqpoint{0.608cm}{1.276cm}}{\pgfqpoint{0.643cm}{1.262cm}}{\pgfqpoint{0.679cm}{1.262cm}}
\pgfpathcurveto{\pgfqpoint{0.715cm}{1.262cm}}{\pgfqpoint{0.75cm}{1.276cm}}{\pgfqpoint{0.775cm}{1.302cm}}
\pgfpathcurveto{\pgfqpoint{0.801cm}{1.328cm}}{\pgfqpoint{0.815cm}{1.363cm}}{\pgfqpoint{0.815cm}{1.399cm}}
\pgfusepath{fill}
\pgfpathmoveto{\pgfqpoint{1.345cm}{1.371cm}}
\pgfpathcurveto{\pgfqpoint{1.345cm}{1.408cm}}{\pgfqpoint{1.331cm}{1.442cm}}{\pgfqpoint{1.305cm}{1.468cm}}
\pgfpathcurveto{\pgfqpoint{1.28cm}{1.494cm}}{\pgfqpoint{1.245cm}{1.508cm}}{\pgfqpoint{1.209cm}{1.508cm}}
\pgfpathcurveto{\pgfqpoint{1.172cm}{1.508cm}}{\pgfqpoint{1.138cm}{1.494cm}}{\pgfqpoint{1.112cm}{1.468cm}}
\pgfpathcurveto{\pgfqpoint{1.087cm}{1.442cm}}{\pgfqpoint{1.072cm}{1.408cm}}{\pgfqpoint{1.072cm}{1.371cm}}
\pgfpathcurveto{\pgfqpoint{1.072cm}{1.335cm}}{\pgfqpoint{1.087cm}{1.3cm}}{\pgfqpoint{1.112cm}{1.274cm}}
\pgfpathcurveto{\pgfqpoint{1.138cm}{1.249cm}}{\pgfqpoint{1.172cm}{1.234cm}}{\pgfqpoint{1.209cm}{1.234cm}}
\pgfpathcurveto{\pgfqpoint{1.245cm}{1.234cm}}{\pgfqpoint{1.28cm}{1.249cm}}{\pgfqpoint{1.305cm}{1.274cm}}
\pgfpathcurveto{\pgfqpoint{1.331cm}{1.3cm}}{\pgfqpoint{1.345cm}{1.335cm}}{\pgfqpoint{1.345cm}{1.371cm}}
\pgfusepath{fill}
\begin{pgfscope}
\pgfsetdash{}{0cm}
\pgfsetlinewidth{0.818mm}
\pgfsetroundcap
\pgfsetmiterlimit{4.0}
\pgfpathmoveto{\pgfqpoint{0.682cm}{0.671cm}}
\pgfpathlineto{\pgfqpoint{0.682cm}{0.042cm}}
\pgfusepath{stroke}
\end{pgfscope}
\end{pgfscope}
\end{pgfscope}
\end{pgfscope}
\end{tikzpicture}}}+\phi+\psi)^2+(-X^{\!\resizebox{0.6em}{!}{
\begin{tikzpicture}
\pgfpathmoveto{\pgfqpoint{0cm}{-0.035cm}}
\pgfpathlineto{\pgfqpoint{1.376cm}{-0.035cm}}
\pgfpathlineto{\pgfqpoint{1.376cm}{1.552cm}}
\pgfpathlineto{\pgfqpoint{0cm}{1.552cm}}
\pgfpathclose
\pgfusepath{clip}
\begin{pgfscope}
\begin{pgfscope}
\pgfpathmoveto{\pgfqpoint{0cm}{-0.035cm}}
\pgfpathlineto{\pgfqpoint{1.376cm}{-0.035cm}}
\pgfpathlineto{\pgfqpoint{1.376cm}{1.552cm}}
\pgfpathlineto{\pgfqpoint{0cm}{1.552cm}}
\pgfpathclose
\pgfusepath{clip}
\begin{pgfscope}
\begin{pgfscope}
\pgfsetdash{}{0cm}
\pgfsetlinewidth{0.818mm}
\pgfsetroundcap
\pgfsetroundjoin
\pgfsetmiterlimit{7.0}
\definecolor{eps2pgf_color}{gray}{0}\pgfsetstrokecolor{eps2pgf_color}\pgfsetfillcolor{eps2pgf_color}
\pgfpathmoveto{\pgfqpoint{0.117cm}{1.421cm}}
\pgfpathlineto{\pgfqpoint{0.682cm}{0.671cm}}
\pgfpathlineto{\pgfqpoint{1.246cm}{1.421cm}}
\pgfusepath{stroke}
\end{pgfscope}
\definecolor{eps2pgf_color}{gray}{0}\pgfsetstrokecolor{eps2pgf_color}\pgfsetfillcolor{eps2pgf_color}
\pgfpathmoveto{\pgfqpoint{0.273cm}{1.395cm}}
\pgfpathcurveto{\pgfqpoint{0.273cm}{1.432cm}}{\pgfqpoint{0.259cm}{1.467cm}}{\pgfqpoint{0.233cm}{1.492cm}}
\pgfpathcurveto{\pgfqpoint{0.207cm}{1.518cm}}{\pgfqpoint{0.173cm}{1.532cm}}{\pgfqpoint{0.137cm}{1.532cm}}
\pgfpathcurveto{\pgfqpoint{0.1cm}{1.532cm}}{\pgfqpoint{0.066cm}{1.518cm}}{\pgfqpoint{0.04cm}{1.492cm}}
\pgfpathcurveto{\pgfqpoint{0.014cm}{1.467cm}}{\pgfqpoint{0cm}{1.432cm}}{\pgfqpoint{0cm}{1.395cm}}
\pgfpathcurveto{\pgfqpoint{0cm}{1.359cm}}{\pgfqpoint{0.014cm}{1.324cm}}{\pgfqpoint{0.04cm}{1.299cm}}
\pgfpathcurveto{\pgfqpoint{0.066cm}{1.273cm}}{\pgfqpoint{0.1cm}{1.258cm}}{\pgfqpoint{0.137cm}{1.258cm}}
\pgfpathcurveto{\pgfqpoint{0.173cm}{1.258cm}}{\pgfqpoint{0.207cm}{1.273cm}}{\pgfqpoint{0.233cm}{1.299cm}}
\pgfpathcurveto{\pgfqpoint{0.259cm}{1.324cm}}{\pgfqpoint{0.273cm}{1.359cm}}{\pgfqpoint{0.273cm}{1.395cm}}
\pgfusepath{fill}
\begin{pgfscope}
\pgfsetdash{}{0cm}
\pgfsetlinewidth{0.818mm}
\pgfsetmiterlimit{7.0}
\pgfpathmoveto{\pgfqpoint{0.682cm}{0.671cm}}
\pgfpathlineto{\pgfqpoint{0.679cm}{1.418cm}}
\pgfusepath{stroke}
\end{pgfscope}
\pgfpathmoveto{\pgfqpoint{0.815cm}{1.399cm}}
\pgfpathcurveto{\pgfqpoint{0.815cm}{1.435cm}}{\pgfqpoint{0.801cm}{1.47cm}}{\pgfqpoint{0.775cm}{1.496cm}}
\pgfpathcurveto{\pgfqpoint{0.75cm}{1.521cm}}{\pgfqpoint{0.715cm}{1.536cm}}{\pgfqpoint{0.679cm}{1.536cm}}
\pgfpathcurveto{\pgfqpoint{0.643cm}{1.536cm}}{\pgfqpoint{0.608cm}{1.521cm}}{\pgfqpoint{0.582cm}{1.496cm}}
\pgfpathcurveto{\pgfqpoint{0.557cm}{1.47cm}}{\pgfqpoint{0.542cm}{1.435cm}}{\pgfqpoint{0.542cm}{1.399cm}}
\pgfpathcurveto{\pgfqpoint{0.542cm}{1.363cm}}{\pgfqpoint{0.557cm}{1.328cm}}{\pgfqpoint{0.582cm}{1.302cm}}
\pgfpathcurveto{\pgfqpoint{0.608cm}{1.276cm}}{\pgfqpoint{0.643cm}{1.262cm}}{\pgfqpoint{0.679cm}{1.262cm}}
\pgfpathcurveto{\pgfqpoint{0.715cm}{1.262cm}}{\pgfqpoint{0.75cm}{1.276cm}}{\pgfqpoint{0.775cm}{1.302cm}}
\pgfpathcurveto{\pgfqpoint{0.801cm}{1.328cm}}{\pgfqpoint{0.815cm}{1.363cm}}{\pgfqpoint{0.815cm}{1.399cm}}
\pgfusepath{fill}
\pgfpathmoveto{\pgfqpoint{1.345cm}{1.371cm}}
\pgfpathcurveto{\pgfqpoint{1.345cm}{1.408cm}}{\pgfqpoint{1.331cm}{1.442cm}}{\pgfqpoint{1.305cm}{1.468cm}}
\pgfpathcurveto{\pgfqpoint{1.28cm}{1.494cm}}{\pgfqpoint{1.245cm}{1.508cm}}{\pgfqpoint{1.209cm}{1.508cm}}
\pgfpathcurveto{\pgfqpoint{1.172cm}{1.508cm}}{\pgfqpoint{1.138cm}{1.494cm}}{\pgfqpoint{1.112cm}{1.468cm}}
\pgfpathcurveto{\pgfqpoint{1.087cm}{1.442cm}}{\pgfqpoint{1.072cm}{1.408cm}}{\pgfqpoint{1.072cm}{1.371cm}}
\pgfpathcurveto{\pgfqpoint{1.072cm}{1.335cm}}{\pgfqpoint{1.087cm}{1.3cm}}{\pgfqpoint{1.112cm}{1.274cm}}
\pgfpathcurveto{\pgfqpoint{1.138cm}{1.249cm}}{\pgfqpoint{1.172cm}{1.234cm}}{\pgfqpoint{1.209cm}{1.234cm}}
\pgfpathcurveto{\pgfqpoint{1.245cm}{1.234cm}}{\pgfqpoint{1.28cm}{1.249cm}}{\pgfqpoint{1.305cm}{1.274cm}}
\pgfpathcurveto{\pgfqpoint{1.331cm}{1.3cm}}{\pgfqpoint{1.345cm}{1.335cm}}{\pgfqpoint{1.345cm}{1.371cm}}
\pgfusepath{fill}
\begin{pgfscope}
\pgfsetdash{}{0cm}
\pgfsetlinewidth{0.818mm}
\pgfsetroundcap
\pgfsetmiterlimit{4.0}
\pgfpathmoveto{\pgfqpoint{0.682cm}{0.671cm}}
\pgfpathlineto{\pgfqpoint{0.682cm}{0.042cm}}
\pgfusepath{stroke}
\end{pgfscope}
\end{pgfscope}
\end{pgfscope}
\end{pgfscope}
\end{tikzpicture}}}+\phi+\psi)^3+3b\varphi.
\end{aligned}
\end{align}
Our goal is to construct $\psi$ with regularity $2+\alpha$ whereas $\phi$ will be of regularity $\frac{1}{2}+\alpha$ for some $\alpha>0$ small. Consequently, the third  term on the right hand side of \eqref{eq:rhs45} is not expected to be well-defined and difficulties also arise in the fourth term. In order to cancel the most irregular part of the third term, we assume further that $\phi$ is paracontrolled by $X^{\!\resizebox{0.6em}{!}{
\begin{tikzpicture}
\pgfpathmoveto{\pgfqpoint{0cm}{0cm}}
\pgfpathlineto{\pgfqpoint{1.376cm}{0cm}}
\pgfpathlineto{\pgfqpoint{1.376cm}{1.588cm}}
\pgfpathlineto{\pgfqpoint{0cm}{1.588cm}}
\pgfpathclose
\pgfusepath{clip}
\begin{pgfscope}
\begin{pgfscope}
\pgfpathmoveto{\pgfqpoint{0cm}{0cm}}
\pgfpathlineto{\pgfqpoint{1.376cm}{0cm}}
\pgfpathlineto{\pgfqpoint{1.376cm}{1.588cm}}
\pgfpathlineto{\pgfqpoint{0cm}{1.588cm}}
\pgfpathclose
\pgfusepath{clip}
\begin{pgfscope}
\begin{pgfscope}
\definecolor{eps2pgf_color}{gray}{0.976471}\pgfsetstrokecolor{eps2pgf_color}\pgfsetfillcolor{eps2pgf_color}
\pgfpathmoveto{\pgfqpoint{0cm}{0cm}}
\pgfpathlineto{\pgfqpoint{1.376cm}{0cm}}
\pgfpathlineto{\pgfqpoint{1.376cm}{1.588cm}}
\pgfpathlineto{\pgfqpoint{0cm}{1.588cm}}
\pgfpathclose
\pgfusepath{fill}
\end{pgfscope}
\begin{pgfscope}
\pgfsetdash{}{0cm}
\pgfsetlinewidth{0.818mm}
\pgfsetroundcap
\pgfsetroundjoin
\pgfsetmiterlimit{7.0}
\definecolor{eps2pgf_color}{gray}{0}\pgfsetstrokecolor{eps2pgf_color}\pgfsetfillcolor{eps2pgf_color}
\pgfpathmoveto{\pgfqpoint{0.117cm}{1.476cm}}
\pgfpathlineto{\pgfqpoint{0.682cm}{0.726cm}}
\pgfpathlineto{\pgfqpoint{1.246cm}{1.476cm}}
\pgfusepath{stroke}
\end{pgfscope}
\definecolor{eps2pgf_color}{gray}{0}\pgfsetstrokecolor{eps2pgf_color}\pgfsetfillcolor{eps2pgf_color}
\pgfpathmoveto{\pgfqpoint{0.273cm}{1.451cm}}
\pgfpathcurveto{\pgfqpoint{0.273cm}{1.487cm}}{\pgfqpoint{0.259cm}{1.522cm}}{\pgfqpoint{0.233cm}{1.547cm}}
\pgfpathcurveto{\pgfqpoint{0.207cm}{1.573cm}}{\pgfqpoint{0.173cm}{1.588cm}}{\pgfqpoint{0.137cm}{1.588cm}}
\pgfpathcurveto{\pgfqpoint{0.1cm}{1.588cm}}{\pgfqpoint{0.066cm}{1.573cm}}{\pgfqpoint{0.04cm}{1.547cm}}
\pgfpathcurveto{\pgfqpoint{0.014cm}{1.522cm}}{\pgfqpoint{0cm}{1.487cm}}{\pgfqpoint{0cm}{1.451cm}}
\pgfpathcurveto{\pgfqpoint{0cm}{1.414cm}}{\pgfqpoint{0.014cm}{1.379cm}}{\pgfqpoint{0.04cm}{1.354cm}}
\pgfpathcurveto{\pgfqpoint{0.066cm}{1.328cm}}{\pgfqpoint{0.1cm}{1.314cm}}{\pgfqpoint{0.137cm}{1.314cm}}
\pgfpathcurveto{\pgfqpoint{0.173cm}{1.314cm}}{\pgfqpoint{0.207cm}{1.328cm}}{\pgfqpoint{0.233cm}{1.354cm}}
\pgfpathcurveto{\pgfqpoint{0.259cm}{1.379cm}}{\pgfqpoint{0.273cm}{1.414cm}}{\pgfqpoint{0.273cm}{1.451cm}}
\pgfusepath{fill}
\pgfpathmoveto{\pgfqpoint{1.345cm}{1.426cm}}
\pgfpathcurveto{\pgfqpoint{1.345cm}{1.463cm}}{\pgfqpoint{1.331cm}{1.497cm}}{\pgfqpoint{1.305cm}{1.523cm}}
\pgfpathcurveto{\pgfqpoint{1.28cm}{1.549cm}}{\pgfqpoint{1.245cm}{1.563cm}}{\pgfqpoint{1.209cm}{1.563cm}}
\pgfpathcurveto{\pgfqpoint{1.172cm}{1.563cm}}{\pgfqpoint{1.138cm}{1.549cm}}{\pgfqpoint{1.112cm}{1.523cm}}
\pgfpathcurveto{\pgfqpoint{1.087cm}{1.497cm}}{\pgfqpoint{1.072cm}{1.463cm}}{\pgfqpoint{1.072cm}{1.426cm}}
\pgfpathcurveto{\pgfqpoint{1.072cm}{1.39cm}}{\pgfqpoint{1.087cm}{1.355cm}}{\pgfqpoint{1.112cm}{1.329cm}}
\pgfpathcurveto{\pgfqpoint{1.138cm}{1.304cm}}{\pgfqpoint{1.172cm}{1.289cm}}{\pgfqpoint{1.209cm}{1.289cm}}
\pgfpathcurveto{\pgfqpoint{1.245cm}{1.289cm}}{\pgfqpoint{1.28cm}{1.304cm}}{\pgfqpoint{1.305cm}{1.329cm}}
\pgfpathcurveto{\pgfqpoint{1.331cm}{1.355cm}}{\pgfqpoint{1.345cm}{1.39cm}}{\pgfqpoint{1.345cm}{1.426cm}}
\pgfusepath{fill}
\begin{pgfscope}
\pgfsetdash{}{0cm}
\pgfsetlinewidth{0.818mm}
\pgfsetroundcap
\pgfsetmiterlimit{4.0}
\pgfpathmoveto{\pgfqpoint{0.682cm}{0.726cm}}
\pgfpathlineto{\pgfqpoint{0.682cm}{0.097cm}}
\pgfusepath{stroke}
\end{pgfscope}
\end{pgfscope}
\end{pgfscope}
\end{pgfscope}
\end{tikzpicture}}}$, namely, it holds
\begin{align}\label{eq:th}
\phi=\vartheta -3(-X^{\!\resizebox{0.6em}{!}{
\begin{tikzpicture}
\pgfpathmoveto{\pgfqpoint{0cm}{-0.035cm}}
\pgfpathlineto{\pgfqpoint{1.376cm}{-0.035cm}}
\pgfpathlineto{\pgfqpoint{1.376cm}{1.552cm}}
\pgfpathlineto{\pgfqpoint{0cm}{1.552cm}}
\pgfpathclose
\pgfusepath{clip}
\begin{pgfscope}
\begin{pgfscope}
\pgfpathmoveto{\pgfqpoint{0cm}{-0.035cm}}
\pgfpathlineto{\pgfqpoint{1.376cm}{-0.035cm}}
\pgfpathlineto{\pgfqpoint{1.376cm}{1.552cm}}
\pgfpathlineto{\pgfqpoint{0cm}{1.552cm}}
\pgfpathclose
\pgfusepath{clip}
\begin{pgfscope}
\begin{pgfscope}
\pgfsetdash{}{0cm}
\pgfsetlinewidth{0.818mm}
\pgfsetroundcap
\pgfsetroundjoin
\pgfsetmiterlimit{7.0}
\definecolor{eps2pgf_color}{gray}{0}\pgfsetstrokecolor{eps2pgf_color}\pgfsetfillcolor{eps2pgf_color}
\pgfpathmoveto{\pgfqpoint{0.117cm}{1.421cm}}
\pgfpathlineto{\pgfqpoint{0.682cm}{0.671cm}}
\pgfpathlineto{\pgfqpoint{1.246cm}{1.421cm}}
\pgfusepath{stroke}
\end{pgfscope}
\definecolor{eps2pgf_color}{gray}{0}\pgfsetstrokecolor{eps2pgf_color}\pgfsetfillcolor{eps2pgf_color}
\pgfpathmoveto{\pgfqpoint{0.273cm}{1.395cm}}
\pgfpathcurveto{\pgfqpoint{0.273cm}{1.432cm}}{\pgfqpoint{0.259cm}{1.467cm}}{\pgfqpoint{0.233cm}{1.492cm}}
\pgfpathcurveto{\pgfqpoint{0.207cm}{1.518cm}}{\pgfqpoint{0.173cm}{1.532cm}}{\pgfqpoint{0.137cm}{1.532cm}}
\pgfpathcurveto{\pgfqpoint{0.1cm}{1.532cm}}{\pgfqpoint{0.066cm}{1.518cm}}{\pgfqpoint{0.04cm}{1.492cm}}
\pgfpathcurveto{\pgfqpoint{0.014cm}{1.467cm}}{\pgfqpoint{0cm}{1.432cm}}{\pgfqpoint{0cm}{1.395cm}}
\pgfpathcurveto{\pgfqpoint{0cm}{1.359cm}}{\pgfqpoint{0.014cm}{1.324cm}}{\pgfqpoint{0.04cm}{1.299cm}}
\pgfpathcurveto{\pgfqpoint{0.066cm}{1.273cm}}{\pgfqpoint{0.1cm}{1.258cm}}{\pgfqpoint{0.137cm}{1.258cm}}
\pgfpathcurveto{\pgfqpoint{0.173cm}{1.258cm}}{\pgfqpoint{0.207cm}{1.273cm}}{\pgfqpoint{0.233cm}{1.299cm}}
\pgfpathcurveto{\pgfqpoint{0.259cm}{1.324cm}}{\pgfqpoint{0.273cm}{1.359cm}}{\pgfqpoint{0.273cm}{1.395cm}}
\pgfusepath{fill}
\begin{pgfscope}
\pgfsetdash{}{0cm}
\pgfsetlinewidth{0.818mm}
\pgfsetmiterlimit{7.0}
\pgfpathmoveto{\pgfqpoint{0.682cm}{0.671cm}}
\pgfpathlineto{\pgfqpoint{0.679cm}{1.418cm}}
\pgfusepath{stroke}
\end{pgfscope}
\pgfpathmoveto{\pgfqpoint{0.815cm}{1.399cm}}
\pgfpathcurveto{\pgfqpoint{0.815cm}{1.435cm}}{\pgfqpoint{0.801cm}{1.47cm}}{\pgfqpoint{0.775cm}{1.496cm}}
\pgfpathcurveto{\pgfqpoint{0.75cm}{1.521cm}}{\pgfqpoint{0.715cm}{1.536cm}}{\pgfqpoint{0.679cm}{1.536cm}}
\pgfpathcurveto{\pgfqpoint{0.643cm}{1.536cm}}{\pgfqpoint{0.608cm}{1.521cm}}{\pgfqpoint{0.582cm}{1.496cm}}
\pgfpathcurveto{\pgfqpoint{0.557cm}{1.47cm}}{\pgfqpoint{0.542cm}{1.435cm}}{\pgfqpoint{0.542cm}{1.399cm}}
\pgfpathcurveto{\pgfqpoint{0.542cm}{1.363cm}}{\pgfqpoint{0.557cm}{1.328cm}}{\pgfqpoint{0.582cm}{1.302cm}}
\pgfpathcurveto{\pgfqpoint{0.608cm}{1.276cm}}{\pgfqpoint{0.643cm}{1.262cm}}{\pgfqpoint{0.679cm}{1.262cm}}
\pgfpathcurveto{\pgfqpoint{0.715cm}{1.262cm}}{\pgfqpoint{0.75cm}{1.276cm}}{\pgfqpoint{0.775cm}{1.302cm}}
\pgfpathcurveto{\pgfqpoint{0.801cm}{1.328cm}}{\pgfqpoint{0.815cm}{1.363cm}}{\pgfqpoint{0.815cm}{1.399cm}}
\pgfusepath{fill}
\pgfpathmoveto{\pgfqpoint{1.345cm}{1.371cm}}
\pgfpathcurveto{\pgfqpoint{1.345cm}{1.408cm}}{\pgfqpoint{1.331cm}{1.442cm}}{\pgfqpoint{1.305cm}{1.468cm}}
\pgfpathcurveto{\pgfqpoint{1.28cm}{1.494cm}}{\pgfqpoint{1.245cm}{1.508cm}}{\pgfqpoint{1.209cm}{1.508cm}}
\pgfpathcurveto{\pgfqpoint{1.172cm}{1.508cm}}{\pgfqpoint{1.138cm}{1.494cm}}{\pgfqpoint{1.112cm}{1.468cm}}
\pgfpathcurveto{\pgfqpoint{1.087cm}{1.442cm}}{\pgfqpoint{1.072cm}{1.408cm}}{\pgfqpoint{1.072cm}{1.371cm}}
\pgfpathcurveto{\pgfqpoint{1.072cm}{1.335cm}}{\pgfqpoint{1.087cm}{1.3cm}}{\pgfqpoint{1.112cm}{1.274cm}}
\pgfpathcurveto{\pgfqpoint{1.138cm}{1.249cm}}{\pgfqpoint{1.172cm}{1.234cm}}{\pgfqpoint{1.209cm}{1.234cm}}
\pgfpathcurveto{\pgfqpoint{1.245cm}{1.234cm}}{\pgfqpoint{1.28cm}{1.249cm}}{\pgfqpoint{1.305cm}{1.274cm}}
\pgfpathcurveto{\pgfqpoint{1.331cm}{1.3cm}}{\pgfqpoint{1.345cm}{1.335cm}}{\pgfqpoint{1.345cm}{1.371cm}}
\pgfusepath{fill}
\begin{pgfscope}
\pgfsetdash{}{0cm}
\pgfsetlinewidth{0.818mm}
\pgfsetroundcap
\pgfsetmiterlimit{4.0}
\pgfpathmoveto{\pgfqpoint{0.682cm}{0.671cm}}
\pgfpathlineto{\pgfqpoint{0.682cm}{0.042cm}}
\pgfusepath{stroke}
\end{pgfscope}
\end{pgfscope}
\end{pgfscope}
\end{pgfscope}
\end{tikzpicture}}} + \phi + \psi)\prec X^{\!\resizebox{0.6em}{!}{
\begin{tikzpicture}
\pgfpathmoveto{\pgfqpoint{0cm}{0cm}}
\pgfpathlineto{\pgfqpoint{1.376cm}{0cm}}
\pgfpathlineto{\pgfqpoint{1.376cm}{1.588cm}}
\pgfpathlineto{\pgfqpoint{0cm}{1.588cm}}
\pgfpathclose
\pgfusepath{clip}
\begin{pgfscope}
\begin{pgfscope}
\pgfpathmoveto{\pgfqpoint{0cm}{0cm}}
\pgfpathlineto{\pgfqpoint{1.376cm}{0cm}}
\pgfpathlineto{\pgfqpoint{1.376cm}{1.588cm}}
\pgfpathlineto{\pgfqpoint{0cm}{1.588cm}}
\pgfpathclose
\pgfusepath{clip}
\begin{pgfscope}
\begin{pgfscope}
\definecolor{eps2pgf_color}{gray}{0.976471}\pgfsetstrokecolor{eps2pgf_color}\pgfsetfillcolor{eps2pgf_color}
\pgfpathmoveto{\pgfqpoint{0cm}{0cm}}
\pgfpathlineto{\pgfqpoint{1.376cm}{0cm}}
\pgfpathlineto{\pgfqpoint{1.376cm}{1.588cm}}
\pgfpathlineto{\pgfqpoint{0cm}{1.588cm}}
\pgfpathclose
\pgfusepath{fill}
\end{pgfscope}
\begin{pgfscope}
\pgfsetdash{}{0cm}
\pgfsetlinewidth{0.818mm}
\pgfsetroundcap
\pgfsetroundjoin
\pgfsetmiterlimit{7.0}
\definecolor{eps2pgf_color}{gray}{0}\pgfsetstrokecolor{eps2pgf_color}\pgfsetfillcolor{eps2pgf_color}
\pgfpathmoveto{\pgfqpoint{0.117cm}{1.476cm}}
\pgfpathlineto{\pgfqpoint{0.682cm}{0.726cm}}
\pgfpathlineto{\pgfqpoint{1.246cm}{1.476cm}}
\pgfusepath{stroke}
\end{pgfscope}
\definecolor{eps2pgf_color}{gray}{0}\pgfsetstrokecolor{eps2pgf_color}\pgfsetfillcolor{eps2pgf_color}
\pgfpathmoveto{\pgfqpoint{0.273cm}{1.451cm}}
\pgfpathcurveto{\pgfqpoint{0.273cm}{1.487cm}}{\pgfqpoint{0.259cm}{1.522cm}}{\pgfqpoint{0.233cm}{1.547cm}}
\pgfpathcurveto{\pgfqpoint{0.207cm}{1.573cm}}{\pgfqpoint{0.173cm}{1.588cm}}{\pgfqpoint{0.137cm}{1.588cm}}
\pgfpathcurveto{\pgfqpoint{0.1cm}{1.588cm}}{\pgfqpoint{0.066cm}{1.573cm}}{\pgfqpoint{0.04cm}{1.547cm}}
\pgfpathcurveto{\pgfqpoint{0.014cm}{1.522cm}}{\pgfqpoint{0cm}{1.487cm}}{\pgfqpoint{0cm}{1.451cm}}
\pgfpathcurveto{\pgfqpoint{0cm}{1.414cm}}{\pgfqpoint{0.014cm}{1.379cm}}{\pgfqpoint{0.04cm}{1.354cm}}
\pgfpathcurveto{\pgfqpoint{0.066cm}{1.328cm}}{\pgfqpoint{0.1cm}{1.314cm}}{\pgfqpoint{0.137cm}{1.314cm}}
\pgfpathcurveto{\pgfqpoint{0.173cm}{1.314cm}}{\pgfqpoint{0.207cm}{1.328cm}}{\pgfqpoint{0.233cm}{1.354cm}}
\pgfpathcurveto{\pgfqpoint{0.259cm}{1.379cm}}{\pgfqpoint{0.273cm}{1.414cm}}{\pgfqpoint{0.273cm}{1.451cm}}
\pgfusepath{fill}
\pgfpathmoveto{\pgfqpoint{1.345cm}{1.426cm}}
\pgfpathcurveto{\pgfqpoint{1.345cm}{1.463cm}}{\pgfqpoint{1.331cm}{1.497cm}}{\pgfqpoint{1.305cm}{1.523cm}}
\pgfpathcurveto{\pgfqpoint{1.28cm}{1.549cm}}{\pgfqpoint{1.245cm}{1.563cm}}{\pgfqpoint{1.209cm}{1.563cm}}
\pgfpathcurveto{\pgfqpoint{1.172cm}{1.563cm}}{\pgfqpoint{1.138cm}{1.549cm}}{\pgfqpoint{1.112cm}{1.523cm}}
\pgfpathcurveto{\pgfqpoint{1.087cm}{1.497cm}}{\pgfqpoint{1.072cm}{1.463cm}}{\pgfqpoint{1.072cm}{1.426cm}}
\pgfpathcurveto{\pgfqpoint{1.072cm}{1.39cm}}{\pgfqpoint{1.087cm}{1.355cm}}{\pgfqpoint{1.112cm}{1.329cm}}
\pgfpathcurveto{\pgfqpoint{1.138cm}{1.304cm}}{\pgfqpoint{1.172cm}{1.289cm}}{\pgfqpoint{1.209cm}{1.289cm}}
\pgfpathcurveto{\pgfqpoint{1.245cm}{1.289cm}}{\pgfqpoint{1.28cm}{1.304cm}}{\pgfqpoint{1.305cm}{1.329cm}}
\pgfpathcurveto{\pgfqpoint{1.331cm}{1.355cm}}{\pgfqpoint{1.345cm}{1.39cm}}{\pgfqpoint{1.345cm}{1.426cm}}
\pgfusepath{fill}
\begin{pgfscope}
\pgfsetdash{}{0cm}
\pgfsetlinewidth{0.818mm}
\pgfsetroundcap
\pgfsetmiterlimit{4.0}
\pgfpathmoveto{\pgfqpoint{0.682cm}{0.726cm}}
\pgfpathlineto{\pgfqpoint{0.682cm}{0.097cm}}
\pgfusepath{stroke}
\end{pgfscope}
\end{pgfscope}
\end{pgfscope}
\end{pgfscope}
\end{tikzpicture}}}
\end{align}
for some $\vartheta$ which is more regular (we will see below that $\vartheta$ has the regularity $1+\alpha$). Hence, \eqref{eq:45} rewrites as
\begin{align}\label{eq:rhs45a}
\begin{aligned}
0&= \Q\vartheta+\Q\psi +3\llbracket X^2 \rrbracket\preccurlyeq(-X^{\!\resizebox{0.6em}{!}{
\begin{tikzpicture}
\pgfpathmoveto{\pgfqpoint{0cm}{-0.035cm}}
\pgfpathlineto{\pgfqpoint{1.376cm}{-0.035cm}}
\pgfpathlineto{\pgfqpoint{1.376cm}{1.552cm}}
\pgfpathlineto{\pgfqpoint{0cm}{1.552cm}}
\pgfpathclose
\pgfusepath{clip}
\begin{pgfscope}
\begin{pgfscope}
\pgfpathmoveto{\pgfqpoint{0cm}{-0.035cm}}
\pgfpathlineto{\pgfqpoint{1.376cm}{-0.035cm}}
\pgfpathlineto{\pgfqpoint{1.376cm}{1.552cm}}
\pgfpathlineto{\pgfqpoint{0cm}{1.552cm}}
\pgfpathclose
\pgfusepath{clip}
\begin{pgfscope}
\begin{pgfscope}
\pgfsetdash{}{0cm}
\pgfsetlinewidth{0.818mm}
\pgfsetroundcap
\pgfsetroundjoin
\pgfsetmiterlimit{7.0}
\definecolor{eps2pgf_color}{gray}{0}\pgfsetstrokecolor{eps2pgf_color}\pgfsetfillcolor{eps2pgf_color}
\pgfpathmoveto{\pgfqpoint{0.117cm}{1.421cm}}
\pgfpathlineto{\pgfqpoint{0.682cm}{0.671cm}}
\pgfpathlineto{\pgfqpoint{1.246cm}{1.421cm}}
\pgfusepath{stroke}
\end{pgfscope}
\definecolor{eps2pgf_color}{gray}{0}\pgfsetstrokecolor{eps2pgf_color}\pgfsetfillcolor{eps2pgf_color}
\pgfpathmoveto{\pgfqpoint{0.273cm}{1.395cm}}
\pgfpathcurveto{\pgfqpoint{0.273cm}{1.432cm}}{\pgfqpoint{0.259cm}{1.467cm}}{\pgfqpoint{0.233cm}{1.492cm}}
\pgfpathcurveto{\pgfqpoint{0.207cm}{1.518cm}}{\pgfqpoint{0.173cm}{1.532cm}}{\pgfqpoint{0.137cm}{1.532cm}}
\pgfpathcurveto{\pgfqpoint{0.1cm}{1.532cm}}{\pgfqpoint{0.066cm}{1.518cm}}{\pgfqpoint{0.04cm}{1.492cm}}
\pgfpathcurveto{\pgfqpoint{0.014cm}{1.467cm}}{\pgfqpoint{0cm}{1.432cm}}{\pgfqpoint{0cm}{1.395cm}}
\pgfpathcurveto{\pgfqpoint{0cm}{1.359cm}}{\pgfqpoint{0.014cm}{1.324cm}}{\pgfqpoint{0.04cm}{1.299cm}}
\pgfpathcurveto{\pgfqpoint{0.066cm}{1.273cm}}{\pgfqpoint{0.1cm}{1.258cm}}{\pgfqpoint{0.137cm}{1.258cm}}
\pgfpathcurveto{\pgfqpoint{0.173cm}{1.258cm}}{\pgfqpoint{0.207cm}{1.273cm}}{\pgfqpoint{0.233cm}{1.299cm}}
\pgfpathcurveto{\pgfqpoint{0.259cm}{1.324cm}}{\pgfqpoint{0.273cm}{1.359cm}}{\pgfqpoint{0.273cm}{1.395cm}}
\pgfusepath{fill}
\begin{pgfscope}
\pgfsetdash{}{0cm}
\pgfsetlinewidth{0.818mm}
\pgfsetmiterlimit{7.0}
\pgfpathmoveto{\pgfqpoint{0.682cm}{0.671cm}}
\pgfpathlineto{\pgfqpoint{0.679cm}{1.418cm}}
\pgfusepath{stroke}
\end{pgfscope}
\pgfpathmoveto{\pgfqpoint{0.815cm}{1.399cm}}
\pgfpathcurveto{\pgfqpoint{0.815cm}{1.435cm}}{\pgfqpoint{0.801cm}{1.47cm}}{\pgfqpoint{0.775cm}{1.496cm}}
\pgfpathcurveto{\pgfqpoint{0.75cm}{1.521cm}}{\pgfqpoint{0.715cm}{1.536cm}}{\pgfqpoint{0.679cm}{1.536cm}}
\pgfpathcurveto{\pgfqpoint{0.643cm}{1.536cm}}{\pgfqpoint{0.608cm}{1.521cm}}{\pgfqpoint{0.582cm}{1.496cm}}
\pgfpathcurveto{\pgfqpoint{0.557cm}{1.47cm}}{\pgfqpoint{0.542cm}{1.435cm}}{\pgfqpoint{0.542cm}{1.399cm}}
\pgfpathcurveto{\pgfqpoint{0.542cm}{1.363cm}}{\pgfqpoint{0.557cm}{1.328cm}}{\pgfqpoint{0.582cm}{1.302cm}}
\pgfpathcurveto{\pgfqpoint{0.608cm}{1.276cm}}{\pgfqpoint{0.643cm}{1.262cm}}{\pgfqpoint{0.679cm}{1.262cm}}
\pgfpathcurveto{\pgfqpoint{0.715cm}{1.262cm}}{\pgfqpoint{0.75cm}{1.276cm}}{\pgfqpoint{0.775cm}{1.302cm}}
\pgfpathcurveto{\pgfqpoint{0.801cm}{1.328cm}}{\pgfqpoint{0.815cm}{1.363cm}}{\pgfqpoint{0.815cm}{1.399cm}}
\pgfusepath{fill}
\pgfpathmoveto{\pgfqpoint{1.345cm}{1.371cm}}
\pgfpathcurveto{\pgfqpoint{1.345cm}{1.408cm}}{\pgfqpoint{1.331cm}{1.442cm}}{\pgfqpoint{1.305cm}{1.468cm}}
\pgfpathcurveto{\pgfqpoint{1.28cm}{1.494cm}}{\pgfqpoint{1.245cm}{1.508cm}}{\pgfqpoint{1.209cm}{1.508cm}}
\pgfpathcurveto{\pgfqpoint{1.172cm}{1.508cm}}{\pgfqpoint{1.138cm}{1.494cm}}{\pgfqpoint{1.112cm}{1.468cm}}
\pgfpathcurveto{\pgfqpoint{1.087cm}{1.442cm}}{\pgfqpoint{1.072cm}{1.408cm}}{\pgfqpoint{1.072cm}{1.371cm}}
\pgfpathcurveto{\pgfqpoint{1.072cm}{1.335cm}}{\pgfqpoint{1.087cm}{1.3cm}}{\pgfqpoint{1.112cm}{1.274cm}}
\pgfpathcurveto{\pgfqpoint{1.138cm}{1.249cm}}{\pgfqpoint{1.172cm}{1.234cm}}{\pgfqpoint{1.209cm}{1.234cm}}
\pgfpathcurveto{\pgfqpoint{1.245cm}{1.234cm}}{\pgfqpoint{1.28cm}{1.249cm}}{\pgfqpoint{1.305cm}{1.274cm}}
\pgfpathcurveto{\pgfqpoint{1.331cm}{1.3cm}}{\pgfqpoint{1.345cm}{1.335cm}}{\pgfqpoint{1.345cm}{1.371cm}}
\pgfusepath{fill}
\begin{pgfscope}
\pgfsetdash{}{0cm}
\pgfsetlinewidth{0.818mm}
\pgfsetroundcap
\pgfsetmiterlimit{4.0}
\pgfpathmoveto{\pgfqpoint{0.682cm}{0.671cm}}
\pgfpathlineto{\pgfqpoint{0.682cm}{0.042cm}}
\pgfusepath{stroke}
\end{pgfscope}
\end{pgfscope}
\end{pgfscope}
\end{pgfscope}
\end{tikzpicture}}}+\phi+\psi)-3[\Q,(-X^{\!\resizebox{0.6em}{!}{
\begin{tikzpicture}
\pgfpathmoveto{\pgfqpoint{0cm}{-0.035cm}}
\pgfpathlineto{\pgfqpoint{1.376cm}{-0.035cm}}
\pgfpathlineto{\pgfqpoint{1.376cm}{1.552cm}}
\pgfpathlineto{\pgfqpoint{0cm}{1.552cm}}
\pgfpathclose
\pgfusepath{clip}
\begin{pgfscope}
\begin{pgfscope}
\pgfpathmoveto{\pgfqpoint{0cm}{-0.035cm}}
\pgfpathlineto{\pgfqpoint{1.376cm}{-0.035cm}}
\pgfpathlineto{\pgfqpoint{1.376cm}{1.552cm}}
\pgfpathlineto{\pgfqpoint{0cm}{1.552cm}}
\pgfpathclose
\pgfusepath{clip}
\begin{pgfscope}
\begin{pgfscope}
\pgfsetdash{}{0cm}
\pgfsetlinewidth{0.818mm}
\pgfsetroundcap
\pgfsetroundjoin
\pgfsetmiterlimit{7.0}
\definecolor{eps2pgf_color}{gray}{0}\pgfsetstrokecolor{eps2pgf_color}\pgfsetfillcolor{eps2pgf_color}
\pgfpathmoveto{\pgfqpoint{0.117cm}{1.421cm}}
\pgfpathlineto{\pgfqpoint{0.682cm}{0.671cm}}
\pgfpathlineto{\pgfqpoint{1.246cm}{1.421cm}}
\pgfusepath{stroke}
\end{pgfscope}
\definecolor{eps2pgf_color}{gray}{0}\pgfsetstrokecolor{eps2pgf_color}\pgfsetfillcolor{eps2pgf_color}
\pgfpathmoveto{\pgfqpoint{0.273cm}{1.395cm}}
\pgfpathcurveto{\pgfqpoint{0.273cm}{1.432cm}}{\pgfqpoint{0.259cm}{1.467cm}}{\pgfqpoint{0.233cm}{1.492cm}}
\pgfpathcurveto{\pgfqpoint{0.207cm}{1.518cm}}{\pgfqpoint{0.173cm}{1.532cm}}{\pgfqpoint{0.137cm}{1.532cm}}
\pgfpathcurveto{\pgfqpoint{0.1cm}{1.532cm}}{\pgfqpoint{0.066cm}{1.518cm}}{\pgfqpoint{0.04cm}{1.492cm}}
\pgfpathcurveto{\pgfqpoint{0.014cm}{1.467cm}}{\pgfqpoint{0cm}{1.432cm}}{\pgfqpoint{0cm}{1.395cm}}
\pgfpathcurveto{\pgfqpoint{0cm}{1.359cm}}{\pgfqpoint{0.014cm}{1.324cm}}{\pgfqpoint{0.04cm}{1.299cm}}
\pgfpathcurveto{\pgfqpoint{0.066cm}{1.273cm}}{\pgfqpoint{0.1cm}{1.258cm}}{\pgfqpoint{0.137cm}{1.258cm}}
\pgfpathcurveto{\pgfqpoint{0.173cm}{1.258cm}}{\pgfqpoint{0.207cm}{1.273cm}}{\pgfqpoint{0.233cm}{1.299cm}}
\pgfpathcurveto{\pgfqpoint{0.259cm}{1.324cm}}{\pgfqpoint{0.273cm}{1.359cm}}{\pgfqpoint{0.273cm}{1.395cm}}
\pgfusepath{fill}
\begin{pgfscope}
\pgfsetdash{}{0cm}
\pgfsetlinewidth{0.818mm}
\pgfsetmiterlimit{7.0}
\pgfpathmoveto{\pgfqpoint{0.682cm}{0.671cm}}
\pgfpathlineto{\pgfqpoint{0.679cm}{1.418cm}}
\pgfusepath{stroke}
\end{pgfscope}
\pgfpathmoveto{\pgfqpoint{0.815cm}{1.399cm}}
\pgfpathcurveto{\pgfqpoint{0.815cm}{1.435cm}}{\pgfqpoint{0.801cm}{1.47cm}}{\pgfqpoint{0.775cm}{1.496cm}}
\pgfpathcurveto{\pgfqpoint{0.75cm}{1.521cm}}{\pgfqpoint{0.715cm}{1.536cm}}{\pgfqpoint{0.679cm}{1.536cm}}
\pgfpathcurveto{\pgfqpoint{0.643cm}{1.536cm}}{\pgfqpoint{0.608cm}{1.521cm}}{\pgfqpoint{0.582cm}{1.496cm}}
\pgfpathcurveto{\pgfqpoint{0.557cm}{1.47cm}}{\pgfqpoint{0.542cm}{1.435cm}}{\pgfqpoint{0.542cm}{1.399cm}}
\pgfpathcurveto{\pgfqpoint{0.542cm}{1.363cm}}{\pgfqpoint{0.557cm}{1.328cm}}{\pgfqpoint{0.582cm}{1.302cm}}
\pgfpathcurveto{\pgfqpoint{0.608cm}{1.276cm}}{\pgfqpoint{0.643cm}{1.262cm}}{\pgfqpoint{0.679cm}{1.262cm}}
\pgfpathcurveto{\pgfqpoint{0.715cm}{1.262cm}}{\pgfqpoint{0.75cm}{1.276cm}}{\pgfqpoint{0.775cm}{1.302cm}}
\pgfpathcurveto{\pgfqpoint{0.801cm}{1.328cm}}{\pgfqpoint{0.815cm}{1.363cm}}{\pgfqpoint{0.815cm}{1.399cm}}
\pgfusepath{fill}
\pgfpathmoveto{\pgfqpoint{1.345cm}{1.371cm}}
\pgfpathcurveto{\pgfqpoint{1.345cm}{1.408cm}}{\pgfqpoint{1.331cm}{1.442cm}}{\pgfqpoint{1.305cm}{1.468cm}}
\pgfpathcurveto{\pgfqpoint{1.28cm}{1.494cm}}{\pgfqpoint{1.245cm}{1.508cm}}{\pgfqpoint{1.209cm}{1.508cm}}
\pgfpathcurveto{\pgfqpoint{1.172cm}{1.508cm}}{\pgfqpoint{1.138cm}{1.494cm}}{\pgfqpoint{1.112cm}{1.468cm}}
\pgfpathcurveto{\pgfqpoint{1.087cm}{1.442cm}}{\pgfqpoint{1.072cm}{1.408cm}}{\pgfqpoint{1.072cm}{1.371cm}}
\pgfpathcurveto{\pgfqpoint{1.072cm}{1.335cm}}{\pgfqpoint{1.087cm}{1.3cm}}{\pgfqpoint{1.112cm}{1.274cm}}
\pgfpathcurveto{\pgfqpoint{1.138cm}{1.249cm}}{\pgfqpoint{1.172cm}{1.234cm}}{\pgfqpoint{1.209cm}{1.234cm}}
\pgfpathcurveto{\pgfqpoint{1.245cm}{1.234cm}}{\pgfqpoint{1.28cm}{1.249cm}}{\pgfqpoint{1.305cm}{1.274cm}}
\pgfpathcurveto{\pgfqpoint{1.331cm}{1.3cm}}{\pgfqpoint{1.345cm}{1.335cm}}{\pgfqpoint{1.345cm}{1.371cm}}
\pgfusepath{fill}
\begin{pgfscope}
\pgfsetdash{}{0cm}
\pgfsetlinewidth{0.818mm}
\pgfsetroundcap
\pgfsetmiterlimit{4.0}
\pgfpathmoveto{\pgfqpoint{0.682cm}{0.671cm}}
\pgfpathlineto{\pgfqpoint{0.682cm}{0.042cm}}
\pgfusepath{stroke}
\end{pgfscope}
\end{pgfscope}
\end{pgfscope}
\end{pgfscope}
\end{tikzpicture}}}+\phi+\psi)\prec]X^{\!\resizebox{0.6em}{!}{
\begin{tikzpicture}
\pgfpathmoveto{\pgfqpoint{0cm}{0cm}}
\pgfpathlineto{\pgfqpoint{1.376cm}{0cm}}
\pgfpathlineto{\pgfqpoint{1.376cm}{1.588cm}}
\pgfpathlineto{\pgfqpoint{0cm}{1.588cm}}
\pgfpathclose
\pgfusepath{clip}
\begin{pgfscope}
\begin{pgfscope}
\pgfpathmoveto{\pgfqpoint{0cm}{0cm}}
\pgfpathlineto{\pgfqpoint{1.376cm}{0cm}}
\pgfpathlineto{\pgfqpoint{1.376cm}{1.588cm}}
\pgfpathlineto{\pgfqpoint{0cm}{1.588cm}}
\pgfpathclose
\pgfusepath{clip}
\begin{pgfscope}
\begin{pgfscope}
\definecolor{eps2pgf_color}{gray}{0.976471}\pgfsetstrokecolor{eps2pgf_color}\pgfsetfillcolor{eps2pgf_color}
\pgfpathmoveto{\pgfqpoint{0cm}{0cm}}
\pgfpathlineto{\pgfqpoint{1.376cm}{0cm}}
\pgfpathlineto{\pgfqpoint{1.376cm}{1.588cm}}
\pgfpathlineto{\pgfqpoint{0cm}{1.588cm}}
\pgfpathclose
\pgfusepath{fill}
\end{pgfscope}
\begin{pgfscope}
\pgfsetdash{}{0cm}
\pgfsetlinewidth{0.818mm}
\pgfsetroundcap
\pgfsetroundjoin
\pgfsetmiterlimit{7.0}
\definecolor{eps2pgf_color}{gray}{0}\pgfsetstrokecolor{eps2pgf_color}\pgfsetfillcolor{eps2pgf_color}
\pgfpathmoveto{\pgfqpoint{0.117cm}{1.476cm}}
\pgfpathlineto{\pgfqpoint{0.682cm}{0.726cm}}
\pgfpathlineto{\pgfqpoint{1.246cm}{1.476cm}}
\pgfusepath{stroke}
\end{pgfscope}
\definecolor{eps2pgf_color}{gray}{0}\pgfsetstrokecolor{eps2pgf_color}\pgfsetfillcolor{eps2pgf_color}
\pgfpathmoveto{\pgfqpoint{0.273cm}{1.451cm}}
\pgfpathcurveto{\pgfqpoint{0.273cm}{1.487cm}}{\pgfqpoint{0.259cm}{1.522cm}}{\pgfqpoint{0.233cm}{1.547cm}}
\pgfpathcurveto{\pgfqpoint{0.207cm}{1.573cm}}{\pgfqpoint{0.173cm}{1.588cm}}{\pgfqpoint{0.137cm}{1.588cm}}
\pgfpathcurveto{\pgfqpoint{0.1cm}{1.588cm}}{\pgfqpoint{0.066cm}{1.573cm}}{\pgfqpoint{0.04cm}{1.547cm}}
\pgfpathcurveto{\pgfqpoint{0.014cm}{1.522cm}}{\pgfqpoint{0cm}{1.487cm}}{\pgfqpoint{0cm}{1.451cm}}
\pgfpathcurveto{\pgfqpoint{0cm}{1.414cm}}{\pgfqpoint{0.014cm}{1.379cm}}{\pgfqpoint{0.04cm}{1.354cm}}
\pgfpathcurveto{\pgfqpoint{0.066cm}{1.328cm}}{\pgfqpoint{0.1cm}{1.314cm}}{\pgfqpoint{0.137cm}{1.314cm}}
\pgfpathcurveto{\pgfqpoint{0.173cm}{1.314cm}}{\pgfqpoint{0.207cm}{1.328cm}}{\pgfqpoint{0.233cm}{1.354cm}}
\pgfpathcurveto{\pgfqpoint{0.259cm}{1.379cm}}{\pgfqpoint{0.273cm}{1.414cm}}{\pgfqpoint{0.273cm}{1.451cm}}
\pgfusepath{fill}
\pgfpathmoveto{\pgfqpoint{1.345cm}{1.426cm}}
\pgfpathcurveto{\pgfqpoint{1.345cm}{1.463cm}}{\pgfqpoint{1.331cm}{1.497cm}}{\pgfqpoint{1.305cm}{1.523cm}}
\pgfpathcurveto{\pgfqpoint{1.28cm}{1.549cm}}{\pgfqpoint{1.245cm}{1.563cm}}{\pgfqpoint{1.209cm}{1.563cm}}
\pgfpathcurveto{\pgfqpoint{1.172cm}{1.563cm}}{\pgfqpoint{1.138cm}{1.549cm}}{\pgfqpoint{1.112cm}{1.523cm}}
\pgfpathcurveto{\pgfqpoint{1.087cm}{1.497cm}}{\pgfqpoint{1.072cm}{1.463cm}}{\pgfqpoint{1.072cm}{1.426cm}}
\pgfpathcurveto{\pgfqpoint{1.072cm}{1.39cm}}{\pgfqpoint{1.087cm}{1.355cm}}{\pgfqpoint{1.112cm}{1.329cm}}
\pgfpathcurveto{\pgfqpoint{1.138cm}{1.304cm}}{\pgfqpoint{1.172cm}{1.289cm}}{\pgfqpoint{1.209cm}{1.289cm}}
\pgfpathcurveto{\pgfqpoint{1.245cm}{1.289cm}}{\pgfqpoint{1.28cm}{1.304cm}}{\pgfqpoint{1.305cm}{1.329cm}}
\pgfpathcurveto{\pgfqpoint{1.331cm}{1.355cm}}{\pgfqpoint{1.345cm}{1.39cm}}{\pgfqpoint{1.345cm}{1.426cm}}
\pgfusepath{fill}
\begin{pgfscope}
\pgfsetdash{}{0cm}
\pgfsetlinewidth{0.818mm}
\pgfsetroundcap
\pgfsetmiterlimit{4.0}
\pgfpathmoveto{\pgfqpoint{0.682cm}{0.726cm}}
\pgfpathlineto{\pgfqpoint{0.682cm}{0.097cm}}
\pgfusepath{stroke}
\end{pgfscope}
\end{pgfscope}
\end{pgfscope}
\end{pgfscope}
\end{tikzpicture}}}\\
&\quad+3X(-X^{\!\resizebox{0.6em}{!}{
\begin{tikzpicture}
\pgfpathmoveto{\pgfqpoint{0cm}{-0.035cm}}
\pgfpathlineto{\pgfqpoint{1.376cm}{-0.035cm}}
\pgfpathlineto{\pgfqpoint{1.376cm}{1.552cm}}
\pgfpathlineto{\pgfqpoint{0cm}{1.552cm}}
\pgfpathclose
\pgfusepath{clip}
\begin{pgfscope}
\begin{pgfscope}
\pgfpathmoveto{\pgfqpoint{0cm}{-0.035cm}}
\pgfpathlineto{\pgfqpoint{1.376cm}{-0.035cm}}
\pgfpathlineto{\pgfqpoint{1.376cm}{1.552cm}}
\pgfpathlineto{\pgfqpoint{0cm}{1.552cm}}
\pgfpathclose
\pgfusepath{clip}
\begin{pgfscope}
\begin{pgfscope}
\pgfsetdash{}{0cm}
\pgfsetlinewidth{0.818mm}
\pgfsetroundcap
\pgfsetroundjoin
\pgfsetmiterlimit{7.0}
\definecolor{eps2pgf_color}{gray}{0}\pgfsetstrokecolor{eps2pgf_color}\pgfsetfillcolor{eps2pgf_color}
\pgfpathmoveto{\pgfqpoint{0.117cm}{1.421cm}}
\pgfpathlineto{\pgfqpoint{0.682cm}{0.671cm}}
\pgfpathlineto{\pgfqpoint{1.246cm}{1.421cm}}
\pgfusepath{stroke}
\end{pgfscope}
\definecolor{eps2pgf_color}{gray}{0}\pgfsetstrokecolor{eps2pgf_color}\pgfsetfillcolor{eps2pgf_color}
\pgfpathmoveto{\pgfqpoint{0.273cm}{1.395cm}}
\pgfpathcurveto{\pgfqpoint{0.273cm}{1.432cm}}{\pgfqpoint{0.259cm}{1.467cm}}{\pgfqpoint{0.233cm}{1.492cm}}
\pgfpathcurveto{\pgfqpoint{0.207cm}{1.518cm}}{\pgfqpoint{0.173cm}{1.532cm}}{\pgfqpoint{0.137cm}{1.532cm}}
\pgfpathcurveto{\pgfqpoint{0.1cm}{1.532cm}}{\pgfqpoint{0.066cm}{1.518cm}}{\pgfqpoint{0.04cm}{1.492cm}}
\pgfpathcurveto{\pgfqpoint{0.014cm}{1.467cm}}{\pgfqpoint{0cm}{1.432cm}}{\pgfqpoint{0cm}{1.395cm}}
\pgfpathcurveto{\pgfqpoint{0cm}{1.359cm}}{\pgfqpoint{0.014cm}{1.324cm}}{\pgfqpoint{0.04cm}{1.299cm}}
\pgfpathcurveto{\pgfqpoint{0.066cm}{1.273cm}}{\pgfqpoint{0.1cm}{1.258cm}}{\pgfqpoint{0.137cm}{1.258cm}}
\pgfpathcurveto{\pgfqpoint{0.173cm}{1.258cm}}{\pgfqpoint{0.207cm}{1.273cm}}{\pgfqpoint{0.233cm}{1.299cm}}
\pgfpathcurveto{\pgfqpoint{0.259cm}{1.324cm}}{\pgfqpoint{0.273cm}{1.359cm}}{\pgfqpoint{0.273cm}{1.395cm}}
\pgfusepath{fill}
\begin{pgfscope}
\pgfsetdash{}{0cm}
\pgfsetlinewidth{0.818mm}
\pgfsetmiterlimit{7.0}
\pgfpathmoveto{\pgfqpoint{0.682cm}{0.671cm}}
\pgfpathlineto{\pgfqpoint{0.679cm}{1.418cm}}
\pgfusepath{stroke}
\end{pgfscope}
\pgfpathmoveto{\pgfqpoint{0.815cm}{1.399cm}}
\pgfpathcurveto{\pgfqpoint{0.815cm}{1.435cm}}{\pgfqpoint{0.801cm}{1.47cm}}{\pgfqpoint{0.775cm}{1.496cm}}
\pgfpathcurveto{\pgfqpoint{0.75cm}{1.521cm}}{\pgfqpoint{0.715cm}{1.536cm}}{\pgfqpoint{0.679cm}{1.536cm}}
\pgfpathcurveto{\pgfqpoint{0.643cm}{1.536cm}}{\pgfqpoint{0.608cm}{1.521cm}}{\pgfqpoint{0.582cm}{1.496cm}}
\pgfpathcurveto{\pgfqpoint{0.557cm}{1.47cm}}{\pgfqpoint{0.542cm}{1.435cm}}{\pgfqpoint{0.542cm}{1.399cm}}
\pgfpathcurveto{\pgfqpoint{0.542cm}{1.363cm}}{\pgfqpoint{0.557cm}{1.328cm}}{\pgfqpoint{0.582cm}{1.302cm}}
\pgfpathcurveto{\pgfqpoint{0.608cm}{1.276cm}}{\pgfqpoint{0.643cm}{1.262cm}}{\pgfqpoint{0.679cm}{1.262cm}}
\pgfpathcurveto{\pgfqpoint{0.715cm}{1.262cm}}{\pgfqpoint{0.75cm}{1.276cm}}{\pgfqpoint{0.775cm}{1.302cm}}
\pgfpathcurveto{\pgfqpoint{0.801cm}{1.328cm}}{\pgfqpoint{0.815cm}{1.363cm}}{\pgfqpoint{0.815cm}{1.399cm}}
\pgfusepath{fill}
\pgfpathmoveto{\pgfqpoint{1.345cm}{1.371cm}}
\pgfpathcurveto{\pgfqpoint{1.345cm}{1.408cm}}{\pgfqpoint{1.331cm}{1.442cm}}{\pgfqpoint{1.305cm}{1.468cm}}
\pgfpathcurveto{\pgfqpoint{1.28cm}{1.494cm}}{\pgfqpoint{1.245cm}{1.508cm}}{\pgfqpoint{1.209cm}{1.508cm}}
\pgfpathcurveto{\pgfqpoint{1.172cm}{1.508cm}}{\pgfqpoint{1.138cm}{1.494cm}}{\pgfqpoint{1.112cm}{1.468cm}}
\pgfpathcurveto{\pgfqpoint{1.087cm}{1.442cm}}{\pgfqpoint{1.072cm}{1.408cm}}{\pgfqpoint{1.072cm}{1.371cm}}
\pgfpathcurveto{\pgfqpoint{1.072cm}{1.335cm}}{\pgfqpoint{1.087cm}{1.3cm}}{\pgfqpoint{1.112cm}{1.274cm}}
\pgfpathcurveto{\pgfqpoint{1.138cm}{1.249cm}}{\pgfqpoint{1.172cm}{1.234cm}}{\pgfqpoint{1.209cm}{1.234cm}}
\pgfpathcurveto{\pgfqpoint{1.245cm}{1.234cm}}{\pgfqpoint{1.28cm}{1.249cm}}{\pgfqpoint{1.305cm}{1.274cm}}
\pgfpathcurveto{\pgfqpoint{1.331cm}{1.3cm}}{\pgfqpoint{1.345cm}{1.335cm}}{\pgfqpoint{1.345cm}{1.371cm}}
\pgfusepath{fill}
\begin{pgfscope}
\pgfsetdash{}{0cm}
\pgfsetlinewidth{0.818mm}
\pgfsetroundcap
\pgfsetmiterlimit{4.0}
\pgfpathmoveto{\pgfqpoint{0.682cm}{0.671cm}}
\pgfpathlineto{\pgfqpoint{0.682cm}{0.042cm}}
\pgfusepath{stroke}
\end{pgfscope}
\end{pgfscope}
\end{pgfscope}
\end{pgfscope}
\end{tikzpicture}}}+\phi+\psi)^2+(-X^{\!\resizebox{0.6em}{!}{
\begin{tikzpicture}
\pgfpathmoveto{\pgfqpoint{0cm}{-0.035cm}}
\pgfpathlineto{\pgfqpoint{1.376cm}{-0.035cm}}
\pgfpathlineto{\pgfqpoint{1.376cm}{1.552cm}}
\pgfpathlineto{\pgfqpoint{0cm}{1.552cm}}
\pgfpathclose
\pgfusepath{clip}
\begin{pgfscope}
\begin{pgfscope}
\pgfpathmoveto{\pgfqpoint{0cm}{-0.035cm}}
\pgfpathlineto{\pgfqpoint{1.376cm}{-0.035cm}}
\pgfpathlineto{\pgfqpoint{1.376cm}{1.552cm}}
\pgfpathlineto{\pgfqpoint{0cm}{1.552cm}}
\pgfpathclose
\pgfusepath{clip}
\begin{pgfscope}
\begin{pgfscope}
\pgfsetdash{}{0cm}
\pgfsetlinewidth{0.818mm}
\pgfsetroundcap
\pgfsetroundjoin
\pgfsetmiterlimit{7.0}
\definecolor{eps2pgf_color}{gray}{0}\pgfsetstrokecolor{eps2pgf_color}\pgfsetfillcolor{eps2pgf_color}
\pgfpathmoveto{\pgfqpoint{0.117cm}{1.421cm}}
\pgfpathlineto{\pgfqpoint{0.682cm}{0.671cm}}
\pgfpathlineto{\pgfqpoint{1.246cm}{1.421cm}}
\pgfusepath{stroke}
\end{pgfscope}
\definecolor{eps2pgf_color}{gray}{0}\pgfsetstrokecolor{eps2pgf_color}\pgfsetfillcolor{eps2pgf_color}
\pgfpathmoveto{\pgfqpoint{0.273cm}{1.395cm}}
\pgfpathcurveto{\pgfqpoint{0.273cm}{1.432cm}}{\pgfqpoint{0.259cm}{1.467cm}}{\pgfqpoint{0.233cm}{1.492cm}}
\pgfpathcurveto{\pgfqpoint{0.207cm}{1.518cm}}{\pgfqpoint{0.173cm}{1.532cm}}{\pgfqpoint{0.137cm}{1.532cm}}
\pgfpathcurveto{\pgfqpoint{0.1cm}{1.532cm}}{\pgfqpoint{0.066cm}{1.518cm}}{\pgfqpoint{0.04cm}{1.492cm}}
\pgfpathcurveto{\pgfqpoint{0.014cm}{1.467cm}}{\pgfqpoint{0cm}{1.432cm}}{\pgfqpoint{0cm}{1.395cm}}
\pgfpathcurveto{\pgfqpoint{0cm}{1.359cm}}{\pgfqpoint{0.014cm}{1.324cm}}{\pgfqpoint{0.04cm}{1.299cm}}
\pgfpathcurveto{\pgfqpoint{0.066cm}{1.273cm}}{\pgfqpoint{0.1cm}{1.258cm}}{\pgfqpoint{0.137cm}{1.258cm}}
\pgfpathcurveto{\pgfqpoint{0.173cm}{1.258cm}}{\pgfqpoint{0.207cm}{1.273cm}}{\pgfqpoint{0.233cm}{1.299cm}}
\pgfpathcurveto{\pgfqpoint{0.259cm}{1.324cm}}{\pgfqpoint{0.273cm}{1.359cm}}{\pgfqpoint{0.273cm}{1.395cm}}
\pgfusepath{fill}
\begin{pgfscope}
\pgfsetdash{}{0cm}
\pgfsetlinewidth{0.818mm}
\pgfsetmiterlimit{7.0}
\pgfpathmoveto{\pgfqpoint{0.682cm}{0.671cm}}
\pgfpathlineto{\pgfqpoint{0.679cm}{1.418cm}}
\pgfusepath{stroke}
\end{pgfscope}
\pgfpathmoveto{\pgfqpoint{0.815cm}{1.399cm}}
\pgfpathcurveto{\pgfqpoint{0.815cm}{1.435cm}}{\pgfqpoint{0.801cm}{1.47cm}}{\pgfqpoint{0.775cm}{1.496cm}}
\pgfpathcurveto{\pgfqpoint{0.75cm}{1.521cm}}{\pgfqpoint{0.715cm}{1.536cm}}{\pgfqpoint{0.679cm}{1.536cm}}
\pgfpathcurveto{\pgfqpoint{0.643cm}{1.536cm}}{\pgfqpoint{0.608cm}{1.521cm}}{\pgfqpoint{0.582cm}{1.496cm}}
\pgfpathcurveto{\pgfqpoint{0.557cm}{1.47cm}}{\pgfqpoint{0.542cm}{1.435cm}}{\pgfqpoint{0.542cm}{1.399cm}}
\pgfpathcurveto{\pgfqpoint{0.542cm}{1.363cm}}{\pgfqpoint{0.557cm}{1.328cm}}{\pgfqpoint{0.582cm}{1.302cm}}
\pgfpathcurveto{\pgfqpoint{0.608cm}{1.276cm}}{\pgfqpoint{0.643cm}{1.262cm}}{\pgfqpoint{0.679cm}{1.262cm}}
\pgfpathcurveto{\pgfqpoint{0.715cm}{1.262cm}}{\pgfqpoint{0.75cm}{1.276cm}}{\pgfqpoint{0.775cm}{1.302cm}}
\pgfpathcurveto{\pgfqpoint{0.801cm}{1.328cm}}{\pgfqpoint{0.815cm}{1.363cm}}{\pgfqpoint{0.815cm}{1.399cm}}
\pgfusepath{fill}
\pgfpathmoveto{\pgfqpoint{1.345cm}{1.371cm}}
\pgfpathcurveto{\pgfqpoint{1.345cm}{1.408cm}}{\pgfqpoint{1.331cm}{1.442cm}}{\pgfqpoint{1.305cm}{1.468cm}}
\pgfpathcurveto{\pgfqpoint{1.28cm}{1.494cm}}{\pgfqpoint{1.245cm}{1.508cm}}{\pgfqpoint{1.209cm}{1.508cm}}
\pgfpathcurveto{\pgfqpoint{1.172cm}{1.508cm}}{\pgfqpoint{1.138cm}{1.494cm}}{\pgfqpoint{1.112cm}{1.468cm}}
\pgfpathcurveto{\pgfqpoint{1.087cm}{1.442cm}}{\pgfqpoint{1.072cm}{1.408cm}}{\pgfqpoint{1.072cm}{1.371cm}}
\pgfpathcurveto{\pgfqpoint{1.072cm}{1.335cm}}{\pgfqpoint{1.087cm}{1.3cm}}{\pgfqpoint{1.112cm}{1.274cm}}
\pgfpathcurveto{\pgfqpoint{1.138cm}{1.249cm}}{\pgfqpoint{1.172cm}{1.234cm}}{\pgfqpoint{1.209cm}{1.234cm}}
\pgfpathcurveto{\pgfqpoint{1.245cm}{1.234cm}}{\pgfqpoint{1.28cm}{1.249cm}}{\pgfqpoint{1.305cm}{1.274cm}}
\pgfpathcurveto{\pgfqpoint{1.331cm}{1.3cm}}{\pgfqpoint{1.345cm}{1.335cm}}{\pgfqpoint{1.345cm}{1.371cm}}
\pgfusepath{fill}
\begin{pgfscope}
\pgfsetdash{}{0cm}
\pgfsetlinewidth{0.818mm}
\pgfsetroundcap
\pgfsetmiterlimit{4.0}
\pgfpathmoveto{\pgfqpoint{0.682cm}{0.671cm}}
\pgfpathlineto{\pgfqpoint{0.682cm}{0.042cm}}
\pgfusepath{stroke}
\end{pgfscope}
\end{pgfscope}
\end{pgfscope}
\end{pgfscope}
\end{tikzpicture}}}+\phi+\psi)^3+3b\varphi.
\end{aligned}
\end{align}

\subsection{Including the localizers}
\label{s:phi}

\rmbb{In this subsection, we introduce a decomposition of \eqref{eq:rhs45} into two equations. To this end, we adopt the following strategy: As the first step,  we decompose the right hand side of \eqref{eq:rhs45} into four parts: in magenta we collect all the contributions of negative regularity containing only various versions of $X$ (belonging at least to $\rmm{\CC^{-1-\kappa}}$), in orange we collect all the terms of negative regularity depending on $\phi+\psi$ (belonging also to $\rmg{\CC^{-1-\kappa}}$), the blue color denotes all the terms belonging locally to $\rmb{L^{\infty}}$ and we keep the term $\psi^3$ separate.
As the next step, we then further decompose each orange term, namely, into a sum of irregular magenta terms (depending also on $\phi+\psi$) and regular blue terms. This leads to the final decomposition \eqref{eq:two} below.}

Within the first step, we write 
\begin{align}\label{eq:2}
&3\llbracket X^2 \rrbracket\succ(-X^{\!\resizebox{0.6em}{!}{
\begin{tikzpicture}
\pgfpathmoveto{\pgfqpoint{0cm}{-0.035cm}}
\pgfpathlineto{\pgfqpoint{1.376cm}{-0.035cm}}
\pgfpathlineto{\pgfqpoint{1.376cm}{1.552cm}}
\pgfpathlineto{\pgfqpoint{0cm}{1.552cm}}
\pgfpathclose
\pgfusepath{clip}
\begin{pgfscope}
\begin{pgfscope}
\pgfpathmoveto{\pgfqpoint{0cm}{-0.035cm}}
\pgfpathlineto{\pgfqpoint{1.376cm}{-0.035cm}}
\pgfpathlineto{\pgfqpoint{1.376cm}{1.552cm}}
\pgfpathlineto{\pgfqpoint{0cm}{1.552cm}}
\pgfpathclose
\pgfusepath{clip}
\begin{pgfscope}
\begin{pgfscope}
\pgfsetdash{}{0cm}
\pgfsetlinewidth{0.818mm}
\pgfsetroundcap
\pgfsetroundjoin
\pgfsetmiterlimit{7.0}
\definecolor{eps2pgf_color}{gray}{0}\pgfsetstrokecolor{eps2pgf_color}\pgfsetfillcolor{eps2pgf_color}
\pgfpathmoveto{\pgfqpoint{0.117cm}{1.421cm}}
\pgfpathlineto{\pgfqpoint{0.682cm}{0.671cm}}
\pgfpathlineto{\pgfqpoint{1.246cm}{1.421cm}}
\pgfusepath{stroke}
\end{pgfscope}
\definecolor{eps2pgf_color}{gray}{0}\pgfsetstrokecolor{eps2pgf_color}\pgfsetfillcolor{eps2pgf_color}
\pgfpathmoveto{\pgfqpoint{0.273cm}{1.395cm}}
\pgfpathcurveto{\pgfqpoint{0.273cm}{1.432cm}}{\pgfqpoint{0.259cm}{1.467cm}}{\pgfqpoint{0.233cm}{1.492cm}}
\pgfpathcurveto{\pgfqpoint{0.207cm}{1.518cm}}{\pgfqpoint{0.173cm}{1.532cm}}{\pgfqpoint{0.137cm}{1.532cm}}
\pgfpathcurveto{\pgfqpoint{0.1cm}{1.532cm}}{\pgfqpoint{0.066cm}{1.518cm}}{\pgfqpoint{0.04cm}{1.492cm}}
\pgfpathcurveto{\pgfqpoint{0.014cm}{1.467cm}}{\pgfqpoint{0cm}{1.432cm}}{\pgfqpoint{0cm}{1.395cm}}
\pgfpathcurveto{\pgfqpoint{0cm}{1.359cm}}{\pgfqpoint{0.014cm}{1.324cm}}{\pgfqpoint{0.04cm}{1.299cm}}
\pgfpathcurveto{\pgfqpoint{0.066cm}{1.273cm}}{\pgfqpoint{0.1cm}{1.258cm}}{\pgfqpoint{0.137cm}{1.258cm}}
\pgfpathcurveto{\pgfqpoint{0.173cm}{1.258cm}}{\pgfqpoint{0.207cm}{1.273cm}}{\pgfqpoint{0.233cm}{1.299cm}}
\pgfpathcurveto{\pgfqpoint{0.259cm}{1.324cm}}{\pgfqpoint{0.273cm}{1.359cm}}{\pgfqpoint{0.273cm}{1.395cm}}
\pgfusepath{fill}
\begin{pgfscope}
\pgfsetdash{}{0cm}
\pgfsetlinewidth{0.818mm}
\pgfsetmiterlimit{7.0}
\pgfpathmoveto{\pgfqpoint{0.682cm}{0.671cm}}
\pgfpathlineto{\pgfqpoint{0.679cm}{1.418cm}}
\pgfusepath{stroke}
\end{pgfscope}
\pgfpathmoveto{\pgfqpoint{0.815cm}{1.399cm}}
\pgfpathcurveto{\pgfqpoint{0.815cm}{1.435cm}}{\pgfqpoint{0.801cm}{1.47cm}}{\pgfqpoint{0.775cm}{1.496cm}}
\pgfpathcurveto{\pgfqpoint{0.75cm}{1.521cm}}{\pgfqpoint{0.715cm}{1.536cm}}{\pgfqpoint{0.679cm}{1.536cm}}
\pgfpathcurveto{\pgfqpoint{0.643cm}{1.536cm}}{\pgfqpoint{0.608cm}{1.521cm}}{\pgfqpoint{0.582cm}{1.496cm}}
\pgfpathcurveto{\pgfqpoint{0.557cm}{1.47cm}}{\pgfqpoint{0.542cm}{1.435cm}}{\pgfqpoint{0.542cm}{1.399cm}}
\pgfpathcurveto{\pgfqpoint{0.542cm}{1.363cm}}{\pgfqpoint{0.557cm}{1.328cm}}{\pgfqpoint{0.582cm}{1.302cm}}
\pgfpathcurveto{\pgfqpoint{0.608cm}{1.276cm}}{\pgfqpoint{0.643cm}{1.262cm}}{\pgfqpoint{0.679cm}{1.262cm}}
\pgfpathcurveto{\pgfqpoint{0.715cm}{1.262cm}}{\pgfqpoint{0.75cm}{1.276cm}}{\pgfqpoint{0.775cm}{1.302cm}}
\pgfpathcurveto{\pgfqpoint{0.801cm}{1.328cm}}{\pgfqpoint{0.815cm}{1.363cm}}{\pgfqpoint{0.815cm}{1.399cm}}
\pgfusepath{fill}
\pgfpathmoveto{\pgfqpoint{1.345cm}{1.371cm}}
\pgfpathcurveto{\pgfqpoint{1.345cm}{1.408cm}}{\pgfqpoint{1.331cm}{1.442cm}}{\pgfqpoint{1.305cm}{1.468cm}}
\pgfpathcurveto{\pgfqpoint{1.28cm}{1.494cm}}{\pgfqpoint{1.245cm}{1.508cm}}{\pgfqpoint{1.209cm}{1.508cm}}
\pgfpathcurveto{\pgfqpoint{1.172cm}{1.508cm}}{\pgfqpoint{1.138cm}{1.494cm}}{\pgfqpoint{1.112cm}{1.468cm}}
\pgfpathcurveto{\pgfqpoint{1.087cm}{1.442cm}}{\pgfqpoint{1.072cm}{1.408cm}}{\pgfqpoint{1.072cm}{1.371cm}}
\pgfpathcurveto{\pgfqpoint{1.072cm}{1.335cm}}{\pgfqpoint{1.087cm}{1.3cm}}{\pgfqpoint{1.112cm}{1.274cm}}
\pgfpathcurveto{\pgfqpoint{1.138cm}{1.249cm}}{\pgfqpoint{1.172cm}{1.234cm}}{\pgfqpoint{1.209cm}{1.234cm}}
\pgfpathcurveto{\pgfqpoint{1.245cm}{1.234cm}}{\pgfqpoint{1.28cm}{1.249cm}}{\pgfqpoint{1.305cm}{1.274cm}}
\pgfpathcurveto{\pgfqpoint{1.331cm}{1.3cm}}{\pgfqpoint{1.345cm}{1.335cm}}{\pgfqpoint{1.345cm}{1.371cm}}
\pgfusepath{fill}
\begin{pgfscope}
\pgfsetdash{}{0cm}
\pgfsetlinewidth{0.818mm}
\pgfsetroundcap
\pgfsetmiterlimit{4.0}
\pgfpathmoveto{\pgfqpoint{0.682cm}{0.671cm}}
\pgfpathlineto{\pgfqpoint{0.682cm}{0.042cm}}
\pgfusepath{stroke}
\end{pgfscope}
\end{pgfscope}
\end{pgfscope}
\end{pgfscope}
\end{tikzpicture}}}+\phi+\psi)=-\rmm{3\llbracket X^2 \rrbracket\succX^{\!\resizebox{0.6em}{!}{
\begin{tikzpicture}
\pgfpathmoveto{\pgfqpoint{0cm}{-0.035cm}}
\pgfpathlineto{\pgfqpoint{1.376cm}{-0.035cm}}
\pgfpathlineto{\pgfqpoint{1.376cm}{1.552cm}}
\pgfpathlineto{\pgfqpoint{0cm}{1.552cm}}
\pgfpathclose
\pgfusepath{clip}
\begin{pgfscope}
\begin{pgfscope}
\pgfpathmoveto{\pgfqpoint{0cm}{-0.035cm}}
\pgfpathlineto{\pgfqpoint{1.376cm}{-0.035cm}}
\pgfpathlineto{\pgfqpoint{1.376cm}{1.552cm}}
\pgfpathlineto{\pgfqpoint{0cm}{1.552cm}}
\pgfpathclose
\pgfusepath{clip}
\begin{pgfscope}
\begin{pgfscope}
\pgfsetdash{}{0cm}
\pgfsetlinewidth{0.818mm}
\pgfsetroundcap
\pgfsetroundjoin
\pgfsetmiterlimit{7.0}
\definecolor{eps2pgf_color}{gray}{0}\pgfsetstrokecolor{eps2pgf_color}\pgfsetfillcolor{eps2pgf_color}
\pgfpathmoveto{\pgfqpoint{0.117cm}{1.421cm}}
\pgfpathlineto{\pgfqpoint{0.682cm}{0.671cm}}
\pgfpathlineto{\pgfqpoint{1.246cm}{1.421cm}}
\pgfusepath{stroke}
\end{pgfscope}
\definecolor{eps2pgf_color}{gray}{0}\pgfsetstrokecolor{eps2pgf_color}\pgfsetfillcolor{eps2pgf_color}
\pgfpathmoveto{\pgfqpoint{0.273cm}{1.395cm}}
\pgfpathcurveto{\pgfqpoint{0.273cm}{1.432cm}}{\pgfqpoint{0.259cm}{1.467cm}}{\pgfqpoint{0.233cm}{1.492cm}}
\pgfpathcurveto{\pgfqpoint{0.207cm}{1.518cm}}{\pgfqpoint{0.173cm}{1.532cm}}{\pgfqpoint{0.137cm}{1.532cm}}
\pgfpathcurveto{\pgfqpoint{0.1cm}{1.532cm}}{\pgfqpoint{0.066cm}{1.518cm}}{\pgfqpoint{0.04cm}{1.492cm}}
\pgfpathcurveto{\pgfqpoint{0.014cm}{1.467cm}}{\pgfqpoint{0cm}{1.432cm}}{\pgfqpoint{0cm}{1.395cm}}
\pgfpathcurveto{\pgfqpoint{0cm}{1.359cm}}{\pgfqpoint{0.014cm}{1.324cm}}{\pgfqpoint{0.04cm}{1.299cm}}
\pgfpathcurveto{\pgfqpoint{0.066cm}{1.273cm}}{\pgfqpoint{0.1cm}{1.258cm}}{\pgfqpoint{0.137cm}{1.258cm}}
\pgfpathcurveto{\pgfqpoint{0.173cm}{1.258cm}}{\pgfqpoint{0.207cm}{1.273cm}}{\pgfqpoint{0.233cm}{1.299cm}}
\pgfpathcurveto{\pgfqpoint{0.259cm}{1.324cm}}{\pgfqpoint{0.273cm}{1.359cm}}{\pgfqpoint{0.273cm}{1.395cm}}
\pgfusepath{fill}
\begin{pgfscope}
\pgfsetdash{}{0cm}
\pgfsetlinewidth{0.818mm}
\pgfsetmiterlimit{7.0}
\pgfpathmoveto{\pgfqpoint{0.682cm}{0.671cm}}
\pgfpathlineto{\pgfqpoint{0.679cm}{1.418cm}}
\pgfusepath{stroke}
\end{pgfscope}
\pgfpathmoveto{\pgfqpoint{0.815cm}{1.399cm}}
\pgfpathcurveto{\pgfqpoint{0.815cm}{1.435cm}}{\pgfqpoint{0.801cm}{1.47cm}}{\pgfqpoint{0.775cm}{1.496cm}}
\pgfpathcurveto{\pgfqpoint{0.75cm}{1.521cm}}{\pgfqpoint{0.715cm}{1.536cm}}{\pgfqpoint{0.679cm}{1.536cm}}
\pgfpathcurveto{\pgfqpoint{0.643cm}{1.536cm}}{\pgfqpoint{0.608cm}{1.521cm}}{\pgfqpoint{0.582cm}{1.496cm}}
\pgfpathcurveto{\pgfqpoint{0.557cm}{1.47cm}}{\pgfqpoint{0.542cm}{1.435cm}}{\pgfqpoint{0.542cm}{1.399cm}}
\pgfpathcurveto{\pgfqpoint{0.542cm}{1.363cm}}{\pgfqpoint{0.557cm}{1.328cm}}{\pgfqpoint{0.582cm}{1.302cm}}
\pgfpathcurveto{\pgfqpoint{0.608cm}{1.276cm}}{\pgfqpoint{0.643cm}{1.262cm}}{\pgfqpoint{0.679cm}{1.262cm}}
\pgfpathcurveto{\pgfqpoint{0.715cm}{1.262cm}}{\pgfqpoint{0.75cm}{1.276cm}}{\pgfqpoint{0.775cm}{1.302cm}}
\pgfpathcurveto{\pgfqpoint{0.801cm}{1.328cm}}{\pgfqpoint{0.815cm}{1.363cm}}{\pgfqpoint{0.815cm}{1.399cm}}
\pgfusepath{fill}
\pgfpathmoveto{\pgfqpoint{1.345cm}{1.371cm}}
\pgfpathcurveto{\pgfqpoint{1.345cm}{1.408cm}}{\pgfqpoint{1.331cm}{1.442cm}}{\pgfqpoint{1.305cm}{1.468cm}}
\pgfpathcurveto{\pgfqpoint{1.28cm}{1.494cm}}{\pgfqpoint{1.245cm}{1.508cm}}{\pgfqpoint{1.209cm}{1.508cm}}
\pgfpathcurveto{\pgfqpoint{1.172cm}{1.508cm}}{\pgfqpoint{1.138cm}{1.494cm}}{\pgfqpoint{1.112cm}{1.468cm}}
\pgfpathcurveto{\pgfqpoint{1.087cm}{1.442cm}}{\pgfqpoint{1.072cm}{1.408cm}}{\pgfqpoint{1.072cm}{1.371cm}}
\pgfpathcurveto{\pgfqpoint{1.072cm}{1.335cm}}{\pgfqpoint{1.087cm}{1.3cm}}{\pgfqpoint{1.112cm}{1.274cm}}
\pgfpathcurveto{\pgfqpoint{1.138cm}{1.249cm}}{\pgfqpoint{1.172cm}{1.234cm}}{\pgfqpoint{1.209cm}{1.234cm}}
\pgfpathcurveto{\pgfqpoint{1.245cm}{1.234cm}}{\pgfqpoint{1.28cm}{1.249cm}}{\pgfqpoint{1.305cm}{1.274cm}}
\pgfpathcurveto{\pgfqpoint{1.331cm}{1.3cm}}{\pgfqpoint{1.345cm}{1.335cm}}{\pgfqpoint{1.345cm}{1.371cm}}
\pgfusepath{fill}
\begin{pgfscope}
\pgfsetdash{}{0cm}
\pgfsetlinewidth{0.818mm}
\pgfsetroundcap
\pgfsetmiterlimit{4.0}
\pgfpathmoveto{\pgfqpoint{0.682cm}{0.671cm}}
\pgfpathlineto{\pgfqpoint{0.682cm}{0.042cm}}
\pgfusepath{stroke}
\end{pgfscope}
\end{pgfscope}
\end{pgfscope}
\end{pgfscope}
\end{tikzpicture}}}}+\rmg{3\llbracket X^2 \rrbracket\succ(\phi+\psi)},
\end{align}
\begin{align*}
&3\llbracket X^2 \rrbracket\preccurlyeq(-X^{\!\resizebox{0.6em}{!}{
\begin{tikzpicture}
\pgfpathmoveto{\pgfqpoint{0cm}{-0.035cm}}
\pgfpathlineto{\pgfqpoint{1.376cm}{-0.035cm}}
\pgfpathlineto{\pgfqpoint{1.376cm}{1.552cm}}
\pgfpathlineto{\pgfqpoint{0cm}{1.552cm}}
\pgfpathclose
\pgfusepath{clip}
\begin{pgfscope}
\begin{pgfscope}
\pgfpathmoveto{\pgfqpoint{0cm}{-0.035cm}}
\pgfpathlineto{\pgfqpoint{1.376cm}{-0.035cm}}
\pgfpathlineto{\pgfqpoint{1.376cm}{1.552cm}}
\pgfpathlineto{\pgfqpoint{0cm}{1.552cm}}
\pgfpathclose
\pgfusepath{clip}
\begin{pgfscope}
\begin{pgfscope}
\pgfsetdash{}{0cm}
\pgfsetlinewidth{0.818mm}
\pgfsetroundcap
\pgfsetroundjoin
\pgfsetmiterlimit{7.0}
\definecolor{eps2pgf_color}{gray}{0}\pgfsetstrokecolor{eps2pgf_color}\pgfsetfillcolor{eps2pgf_color}
\pgfpathmoveto{\pgfqpoint{0.117cm}{1.421cm}}
\pgfpathlineto{\pgfqpoint{0.682cm}{0.671cm}}
\pgfpathlineto{\pgfqpoint{1.246cm}{1.421cm}}
\pgfusepath{stroke}
\end{pgfscope}
\definecolor{eps2pgf_color}{gray}{0}\pgfsetstrokecolor{eps2pgf_color}\pgfsetfillcolor{eps2pgf_color}
\pgfpathmoveto{\pgfqpoint{0.273cm}{1.395cm}}
\pgfpathcurveto{\pgfqpoint{0.273cm}{1.432cm}}{\pgfqpoint{0.259cm}{1.467cm}}{\pgfqpoint{0.233cm}{1.492cm}}
\pgfpathcurveto{\pgfqpoint{0.207cm}{1.518cm}}{\pgfqpoint{0.173cm}{1.532cm}}{\pgfqpoint{0.137cm}{1.532cm}}
\pgfpathcurveto{\pgfqpoint{0.1cm}{1.532cm}}{\pgfqpoint{0.066cm}{1.518cm}}{\pgfqpoint{0.04cm}{1.492cm}}
\pgfpathcurveto{\pgfqpoint{0.014cm}{1.467cm}}{\pgfqpoint{0cm}{1.432cm}}{\pgfqpoint{0cm}{1.395cm}}
\pgfpathcurveto{\pgfqpoint{0cm}{1.359cm}}{\pgfqpoint{0.014cm}{1.324cm}}{\pgfqpoint{0.04cm}{1.299cm}}
\pgfpathcurveto{\pgfqpoint{0.066cm}{1.273cm}}{\pgfqpoint{0.1cm}{1.258cm}}{\pgfqpoint{0.137cm}{1.258cm}}
\pgfpathcurveto{\pgfqpoint{0.173cm}{1.258cm}}{\pgfqpoint{0.207cm}{1.273cm}}{\pgfqpoint{0.233cm}{1.299cm}}
\pgfpathcurveto{\pgfqpoint{0.259cm}{1.324cm}}{\pgfqpoint{0.273cm}{1.359cm}}{\pgfqpoint{0.273cm}{1.395cm}}
\pgfusepath{fill}
\begin{pgfscope}
\pgfsetdash{}{0cm}
\pgfsetlinewidth{0.818mm}
\pgfsetmiterlimit{7.0}
\pgfpathmoveto{\pgfqpoint{0.682cm}{0.671cm}}
\pgfpathlineto{\pgfqpoint{0.679cm}{1.418cm}}
\pgfusepath{stroke}
\end{pgfscope}
\pgfpathmoveto{\pgfqpoint{0.815cm}{1.399cm}}
\pgfpathcurveto{\pgfqpoint{0.815cm}{1.435cm}}{\pgfqpoint{0.801cm}{1.47cm}}{\pgfqpoint{0.775cm}{1.496cm}}
\pgfpathcurveto{\pgfqpoint{0.75cm}{1.521cm}}{\pgfqpoint{0.715cm}{1.536cm}}{\pgfqpoint{0.679cm}{1.536cm}}
\pgfpathcurveto{\pgfqpoint{0.643cm}{1.536cm}}{\pgfqpoint{0.608cm}{1.521cm}}{\pgfqpoint{0.582cm}{1.496cm}}
\pgfpathcurveto{\pgfqpoint{0.557cm}{1.47cm}}{\pgfqpoint{0.542cm}{1.435cm}}{\pgfqpoint{0.542cm}{1.399cm}}
\pgfpathcurveto{\pgfqpoint{0.542cm}{1.363cm}}{\pgfqpoint{0.557cm}{1.328cm}}{\pgfqpoint{0.582cm}{1.302cm}}
\pgfpathcurveto{\pgfqpoint{0.608cm}{1.276cm}}{\pgfqpoint{0.643cm}{1.262cm}}{\pgfqpoint{0.679cm}{1.262cm}}
\pgfpathcurveto{\pgfqpoint{0.715cm}{1.262cm}}{\pgfqpoint{0.75cm}{1.276cm}}{\pgfqpoint{0.775cm}{1.302cm}}
\pgfpathcurveto{\pgfqpoint{0.801cm}{1.328cm}}{\pgfqpoint{0.815cm}{1.363cm}}{\pgfqpoint{0.815cm}{1.399cm}}
\pgfusepath{fill}
\pgfpathmoveto{\pgfqpoint{1.345cm}{1.371cm}}
\pgfpathcurveto{\pgfqpoint{1.345cm}{1.408cm}}{\pgfqpoint{1.331cm}{1.442cm}}{\pgfqpoint{1.305cm}{1.468cm}}
\pgfpathcurveto{\pgfqpoint{1.28cm}{1.494cm}}{\pgfqpoint{1.245cm}{1.508cm}}{\pgfqpoint{1.209cm}{1.508cm}}
\pgfpathcurveto{\pgfqpoint{1.172cm}{1.508cm}}{\pgfqpoint{1.138cm}{1.494cm}}{\pgfqpoint{1.112cm}{1.468cm}}
\pgfpathcurveto{\pgfqpoint{1.087cm}{1.442cm}}{\pgfqpoint{1.072cm}{1.408cm}}{\pgfqpoint{1.072cm}{1.371cm}}
\pgfpathcurveto{\pgfqpoint{1.072cm}{1.335cm}}{\pgfqpoint{1.087cm}{1.3cm}}{\pgfqpoint{1.112cm}{1.274cm}}
\pgfpathcurveto{\pgfqpoint{1.138cm}{1.249cm}}{\pgfqpoint{1.172cm}{1.234cm}}{\pgfqpoint{1.209cm}{1.234cm}}
\pgfpathcurveto{\pgfqpoint{1.245cm}{1.234cm}}{\pgfqpoint{1.28cm}{1.249cm}}{\pgfqpoint{1.305cm}{1.274cm}}
\pgfpathcurveto{\pgfqpoint{1.331cm}{1.3cm}}{\pgfqpoint{1.345cm}{1.335cm}}{\pgfqpoint{1.345cm}{1.371cm}}
\pgfusepath{fill}
\begin{pgfscope}
\pgfsetdash{}{0cm}
\pgfsetlinewidth{0.818mm}
\pgfsetroundcap
\pgfsetmiterlimit{4.0}
\pgfpathmoveto{\pgfqpoint{0.682cm}{0.671cm}}
\pgfpathlineto{\pgfqpoint{0.682cm}{0.042cm}}
\pgfusepath{stroke}
\end{pgfscope}
\end{pgfscope}
\end{pgfscope}
\end{pgfscope}
\end{tikzpicture}}}+\phi+\psi)=-\rmm{3\llbracket X^2 \rrbracket\precX^{\!\resizebox{0.6em}{!}{
\begin{tikzpicture}
\pgfpathmoveto{\pgfqpoint{0cm}{-0.035cm}}
\pgfpathlineto{\pgfqpoint{1.376cm}{-0.035cm}}
\pgfpathlineto{\pgfqpoint{1.376cm}{1.552cm}}
\pgfpathlineto{\pgfqpoint{0cm}{1.552cm}}
\pgfpathclose
\pgfusepath{clip}
\begin{pgfscope}
\begin{pgfscope}
\pgfpathmoveto{\pgfqpoint{0cm}{-0.035cm}}
\pgfpathlineto{\pgfqpoint{1.376cm}{-0.035cm}}
\pgfpathlineto{\pgfqpoint{1.376cm}{1.552cm}}
\pgfpathlineto{\pgfqpoint{0cm}{1.552cm}}
\pgfpathclose
\pgfusepath{clip}
\begin{pgfscope}
\begin{pgfscope}
\pgfsetdash{}{0cm}
\pgfsetlinewidth{0.818mm}
\pgfsetroundcap
\pgfsetroundjoin
\pgfsetmiterlimit{7.0}
\definecolor{eps2pgf_color}{gray}{0}\pgfsetstrokecolor{eps2pgf_color}\pgfsetfillcolor{eps2pgf_color}
\pgfpathmoveto{\pgfqpoint{0.117cm}{1.421cm}}
\pgfpathlineto{\pgfqpoint{0.682cm}{0.671cm}}
\pgfpathlineto{\pgfqpoint{1.246cm}{1.421cm}}
\pgfusepath{stroke}
\end{pgfscope}
\definecolor{eps2pgf_color}{gray}{0}\pgfsetstrokecolor{eps2pgf_color}\pgfsetfillcolor{eps2pgf_color}
\pgfpathmoveto{\pgfqpoint{0.273cm}{1.395cm}}
\pgfpathcurveto{\pgfqpoint{0.273cm}{1.432cm}}{\pgfqpoint{0.259cm}{1.467cm}}{\pgfqpoint{0.233cm}{1.492cm}}
\pgfpathcurveto{\pgfqpoint{0.207cm}{1.518cm}}{\pgfqpoint{0.173cm}{1.532cm}}{\pgfqpoint{0.137cm}{1.532cm}}
\pgfpathcurveto{\pgfqpoint{0.1cm}{1.532cm}}{\pgfqpoint{0.066cm}{1.518cm}}{\pgfqpoint{0.04cm}{1.492cm}}
\pgfpathcurveto{\pgfqpoint{0.014cm}{1.467cm}}{\pgfqpoint{0cm}{1.432cm}}{\pgfqpoint{0cm}{1.395cm}}
\pgfpathcurveto{\pgfqpoint{0cm}{1.359cm}}{\pgfqpoint{0.014cm}{1.324cm}}{\pgfqpoint{0.04cm}{1.299cm}}
\pgfpathcurveto{\pgfqpoint{0.066cm}{1.273cm}}{\pgfqpoint{0.1cm}{1.258cm}}{\pgfqpoint{0.137cm}{1.258cm}}
\pgfpathcurveto{\pgfqpoint{0.173cm}{1.258cm}}{\pgfqpoint{0.207cm}{1.273cm}}{\pgfqpoint{0.233cm}{1.299cm}}
\pgfpathcurveto{\pgfqpoint{0.259cm}{1.324cm}}{\pgfqpoint{0.273cm}{1.359cm}}{\pgfqpoint{0.273cm}{1.395cm}}
\pgfusepath{fill}
\begin{pgfscope}
\pgfsetdash{}{0cm}
\pgfsetlinewidth{0.818mm}
\pgfsetmiterlimit{7.0}
\pgfpathmoveto{\pgfqpoint{0.682cm}{0.671cm}}
\pgfpathlineto{\pgfqpoint{0.679cm}{1.418cm}}
\pgfusepath{stroke}
\end{pgfscope}
\pgfpathmoveto{\pgfqpoint{0.815cm}{1.399cm}}
\pgfpathcurveto{\pgfqpoint{0.815cm}{1.435cm}}{\pgfqpoint{0.801cm}{1.47cm}}{\pgfqpoint{0.775cm}{1.496cm}}
\pgfpathcurveto{\pgfqpoint{0.75cm}{1.521cm}}{\pgfqpoint{0.715cm}{1.536cm}}{\pgfqpoint{0.679cm}{1.536cm}}
\pgfpathcurveto{\pgfqpoint{0.643cm}{1.536cm}}{\pgfqpoint{0.608cm}{1.521cm}}{\pgfqpoint{0.582cm}{1.496cm}}
\pgfpathcurveto{\pgfqpoint{0.557cm}{1.47cm}}{\pgfqpoint{0.542cm}{1.435cm}}{\pgfqpoint{0.542cm}{1.399cm}}
\pgfpathcurveto{\pgfqpoint{0.542cm}{1.363cm}}{\pgfqpoint{0.557cm}{1.328cm}}{\pgfqpoint{0.582cm}{1.302cm}}
\pgfpathcurveto{\pgfqpoint{0.608cm}{1.276cm}}{\pgfqpoint{0.643cm}{1.262cm}}{\pgfqpoint{0.679cm}{1.262cm}}
\pgfpathcurveto{\pgfqpoint{0.715cm}{1.262cm}}{\pgfqpoint{0.75cm}{1.276cm}}{\pgfqpoint{0.775cm}{1.302cm}}
\pgfpathcurveto{\pgfqpoint{0.801cm}{1.328cm}}{\pgfqpoint{0.815cm}{1.363cm}}{\pgfqpoint{0.815cm}{1.399cm}}
\pgfusepath{fill}
\pgfpathmoveto{\pgfqpoint{1.345cm}{1.371cm}}
\pgfpathcurveto{\pgfqpoint{1.345cm}{1.408cm}}{\pgfqpoint{1.331cm}{1.442cm}}{\pgfqpoint{1.305cm}{1.468cm}}
\pgfpathcurveto{\pgfqpoint{1.28cm}{1.494cm}}{\pgfqpoint{1.245cm}{1.508cm}}{\pgfqpoint{1.209cm}{1.508cm}}
\pgfpathcurveto{\pgfqpoint{1.172cm}{1.508cm}}{\pgfqpoint{1.138cm}{1.494cm}}{\pgfqpoint{1.112cm}{1.468cm}}
\pgfpathcurveto{\pgfqpoint{1.087cm}{1.442cm}}{\pgfqpoint{1.072cm}{1.408cm}}{\pgfqpoint{1.072cm}{1.371cm}}
\pgfpathcurveto{\pgfqpoint{1.072cm}{1.335cm}}{\pgfqpoint{1.087cm}{1.3cm}}{\pgfqpoint{1.112cm}{1.274cm}}
\pgfpathcurveto{\pgfqpoint{1.138cm}{1.249cm}}{\pgfqpoint{1.172cm}{1.234cm}}{\pgfqpoint{1.209cm}{1.234cm}}
\pgfpathcurveto{\pgfqpoint{1.245cm}{1.234cm}}{\pgfqpoint{1.28cm}{1.249cm}}{\pgfqpoint{1.305cm}{1.274cm}}
\pgfpathcurveto{\pgfqpoint{1.331cm}{1.3cm}}{\pgfqpoint{1.345cm}{1.335cm}}{\pgfqpoint{1.345cm}{1.371cm}}
\pgfusepath{fill}
\begin{pgfscope}
\pgfsetdash{}{0cm}
\pgfsetlinewidth{0.818mm}
\pgfsetroundcap
\pgfsetmiterlimit{4.0}
\pgfpathmoveto{\pgfqpoint{0.682cm}{0.671cm}}
\pgfpathlineto{\pgfqpoint{0.682cm}{0.042cm}}
\pgfusepath{stroke}
\end{pgfscope}
\end{pgfscope}
\end{pgfscope}
\end{pgfscope}
\end{tikzpicture}}}}+\rmg{3\llbracket X^2 \rrbracket\prec(\phi+\psi)}+3\llbracket X^2 \rrbracket\circ(-X^{\!\resizebox{0.6em}{!}{
\begin{tikzpicture}
\pgfpathmoveto{\pgfqpoint{0cm}{-0.035cm}}
\pgfpathlineto{\pgfqpoint{1.376cm}{-0.035cm}}
\pgfpathlineto{\pgfqpoint{1.376cm}{1.552cm}}
\pgfpathlineto{\pgfqpoint{0cm}{1.552cm}}
\pgfpathclose
\pgfusepath{clip}
\begin{pgfscope}
\begin{pgfscope}
\pgfpathmoveto{\pgfqpoint{0cm}{-0.035cm}}
\pgfpathlineto{\pgfqpoint{1.376cm}{-0.035cm}}
\pgfpathlineto{\pgfqpoint{1.376cm}{1.552cm}}
\pgfpathlineto{\pgfqpoint{0cm}{1.552cm}}
\pgfpathclose
\pgfusepath{clip}
\begin{pgfscope}
\begin{pgfscope}
\pgfsetdash{}{0cm}
\pgfsetlinewidth{0.818mm}
\pgfsetroundcap
\pgfsetroundjoin
\pgfsetmiterlimit{7.0}
\definecolor{eps2pgf_color}{gray}{0}\pgfsetstrokecolor{eps2pgf_color}\pgfsetfillcolor{eps2pgf_color}
\pgfpathmoveto{\pgfqpoint{0.117cm}{1.421cm}}
\pgfpathlineto{\pgfqpoint{0.682cm}{0.671cm}}
\pgfpathlineto{\pgfqpoint{1.246cm}{1.421cm}}
\pgfusepath{stroke}
\end{pgfscope}
\definecolor{eps2pgf_color}{gray}{0}\pgfsetstrokecolor{eps2pgf_color}\pgfsetfillcolor{eps2pgf_color}
\pgfpathmoveto{\pgfqpoint{0.273cm}{1.395cm}}
\pgfpathcurveto{\pgfqpoint{0.273cm}{1.432cm}}{\pgfqpoint{0.259cm}{1.467cm}}{\pgfqpoint{0.233cm}{1.492cm}}
\pgfpathcurveto{\pgfqpoint{0.207cm}{1.518cm}}{\pgfqpoint{0.173cm}{1.532cm}}{\pgfqpoint{0.137cm}{1.532cm}}
\pgfpathcurveto{\pgfqpoint{0.1cm}{1.532cm}}{\pgfqpoint{0.066cm}{1.518cm}}{\pgfqpoint{0.04cm}{1.492cm}}
\pgfpathcurveto{\pgfqpoint{0.014cm}{1.467cm}}{\pgfqpoint{0cm}{1.432cm}}{\pgfqpoint{0cm}{1.395cm}}
\pgfpathcurveto{\pgfqpoint{0cm}{1.359cm}}{\pgfqpoint{0.014cm}{1.324cm}}{\pgfqpoint{0.04cm}{1.299cm}}
\pgfpathcurveto{\pgfqpoint{0.066cm}{1.273cm}}{\pgfqpoint{0.1cm}{1.258cm}}{\pgfqpoint{0.137cm}{1.258cm}}
\pgfpathcurveto{\pgfqpoint{0.173cm}{1.258cm}}{\pgfqpoint{0.207cm}{1.273cm}}{\pgfqpoint{0.233cm}{1.299cm}}
\pgfpathcurveto{\pgfqpoint{0.259cm}{1.324cm}}{\pgfqpoint{0.273cm}{1.359cm}}{\pgfqpoint{0.273cm}{1.395cm}}
\pgfusepath{fill}
\begin{pgfscope}
\pgfsetdash{}{0cm}
\pgfsetlinewidth{0.818mm}
\pgfsetmiterlimit{7.0}
\pgfpathmoveto{\pgfqpoint{0.682cm}{0.671cm}}
\pgfpathlineto{\pgfqpoint{0.679cm}{1.418cm}}
\pgfusepath{stroke}
\end{pgfscope}
\pgfpathmoveto{\pgfqpoint{0.815cm}{1.399cm}}
\pgfpathcurveto{\pgfqpoint{0.815cm}{1.435cm}}{\pgfqpoint{0.801cm}{1.47cm}}{\pgfqpoint{0.775cm}{1.496cm}}
\pgfpathcurveto{\pgfqpoint{0.75cm}{1.521cm}}{\pgfqpoint{0.715cm}{1.536cm}}{\pgfqpoint{0.679cm}{1.536cm}}
\pgfpathcurveto{\pgfqpoint{0.643cm}{1.536cm}}{\pgfqpoint{0.608cm}{1.521cm}}{\pgfqpoint{0.582cm}{1.496cm}}
\pgfpathcurveto{\pgfqpoint{0.557cm}{1.47cm}}{\pgfqpoint{0.542cm}{1.435cm}}{\pgfqpoint{0.542cm}{1.399cm}}
\pgfpathcurveto{\pgfqpoint{0.542cm}{1.363cm}}{\pgfqpoint{0.557cm}{1.328cm}}{\pgfqpoint{0.582cm}{1.302cm}}
\pgfpathcurveto{\pgfqpoint{0.608cm}{1.276cm}}{\pgfqpoint{0.643cm}{1.262cm}}{\pgfqpoint{0.679cm}{1.262cm}}
\pgfpathcurveto{\pgfqpoint{0.715cm}{1.262cm}}{\pgfqpoint{0.75cm}{1.276cm}}{\pgfqpoint{0.775cm}{1.302cm}}
\pgfpathcurveto{\pgfqpoint{0.801cm}{1.328cm}}{\pgfqpoint{0.815cm}{1.363cm}}{\pgfqpoint{0.815cm}{1.399cm}}
\pgfusepath{fill}
\pgfpathmoveto{\pgfqpoint{1.345cm}{1.371cm}}
\pgfpathcurveto{\pgfqpoint{1.345cm}{1.408cm}}{\pgfqpoint{1.331cm}{1.442cm}}{\pgfqpoint{1.305cm}{1.468cm}}
\pgfpathcurveto{\pgfqpoint{1.28cm}{1.494cm}}{\pgfqpoint{1.245cm}{1.508cm}}{\pgfqpoint{1.209cm}{1.508cm}}
\pgfpathcurveto{\pgfqpoint{1.172cm}{1.508cm}}{\pgfqpoint{1.138cm}{1.494cm}}{\pgfqpoint{1.112cm}{1.468cm}}
\pgfpathcurveto{\pgfqpoint{1.087cm}{1.442cm}}{\pgfqpoint{1.072cm}{1.408cm}}{\pgfqpoint{1.072cm}{1.371cm}}
\pgfpathcurveto{\pgfqpoint{1.072cm}{1.335cm}}{\pgfqpoint{1.087cm}{1.3cm}}{\pgfqpoint{1.112cm}{1.274cm}}
\pgfpathcurveto{\pgfqpoint{1.138cm}{1.249cm}}{\pgfqpoint{1.172cm}{1.234cm}}{\pgfqpoint{1.209cm}{1.234cm}}
\pgfpathcurveto{\pgfqpoint{1.245cm}{1.234cm}}{\pgfqpoint{1.28cm}{1.249cm}}{\pgfqpoint{1.305cm}{1.274cm}}
\pgfpathcurveto{\pgfqpoint{1.331cm}{1.3cm}}{\pgfqpoint{1.345cm}{1.335cm}}{\pgfqpoint{1.345cm}{1.371cm}}
\pgfusepath{fill}
\begin{pgfscope}
\pgfsetdash{}{0cm}
\pgfsetlinewidth{0.818mm}
\pgfsetroundcap
\pgfsetmiterlimit{4.0}
\pgfpathmoveto{\pgfqpoint{0.682cm}{0.671cm}}
\pgfpathlineto{\pgfqpoint{0.682cm}{0.042cm}}
\pgfusepath{stroke}
\end{pgfscope}
\end{pgfscope}
\end{pgfscope}
\end{pgfscope}
\end{tikzpicture}}}+\phi)+\rmb{3\llbracket X^2 \rrbracket\circ\psi}.
\end{align*}
Now we add the last term from the right hand side of \eqref{eq:rhs45a} to obtain
\begin{align*}
&3\llbracket X^2 \rrbracket\circ(-X^{\!\resizebox{0.6em}{!}{
\begin{tikzpicture}
\pgfpathmoveto{\pgfqpoint{0cm}{-0.035cm}}
\pgfpathlineto{\pgfqpoint{1.376cm}{-0.035cm}}
\pgfpathlineto{\pgfqpoint{1.376cm}{1.552cm}}
\pgfpathlineto{\pgfqpoint{0cm}{1.552cm}}
\pgfpathclose
\pgfusepath{clip}
\begin{pgfscope}
\begin{pgfscope}
\pgfpathmoveto{\pgfqpoint{0cm}{-0.035cm}}
\pgfpathlineto{\pgfqpoint{1.376cm}{-0.035cm}}
\pgfpathlineto{\pgfqpoint{1.376cm}{1.552cm}}
\pgfpathlineto{\pgfqpoint{0cm}{1.552cm}}
\pgfpathclose
\pgfusepath{clip}
\begin{pgfscope}
\begin{pgfscope}
\pgfsetdash{}{0cm}
\pgfsetlinewidth{0.818mm}
\pgfsetroundcap
\pgfsetroundjoin
\pgfsetmiterlimit{7.0}
\definecolor{eps2pgf_color}{gray}{0}\pgfsetstrokecolor{eps2pgf_color}\pgfsetfillcolor{eps2pgf_color}
\pgfpathmoveto{\pgfqpoint{0.117cm}{1.421cm}}
\pgfpathlineto{\pgfqpoint{0.682cm}{0.671cm}}
\pgfpathlineto{\pgfqpoint{1.246cm}{1.421cm}}
\pgfusepath{stroke}
\end{pgfscope}
\definecolor{eps2pgf_color}{gray}{0}\pgfsetstrokecolor{eps2pgf_color}\pgfsetfillcolor{eps2pgf_color}
\pgfpathmoveto{\pgfqpoint{0.273cm}{1.395cm}}
\pgfpathcurveto{\pgfqpoint{0.273cm}{1.432cm}}{\pgfqpoint{0.259cm}{1.467cm}}{\pgfqpoint{0.233cm}{1.492cm}}
\pgfpathcurveto{\pgfqpoint{0.207cm}{1.518cm}}{\pgfqpoint{0.173cm}{1.532cm}}{\pgfqpoint{0.137cm}{1.532cm}}
\pgfpathcurveto{\pgfqpoint{0.1cm}{1.532cm}}{\pgfqpoint{0.066cm}{1.518cm}}{\pgfqpoint{0.04cm}{1.492cm}}
\pgfpathcurveto{\pgfqpoint{0.014cm}{1.467cm}}{\pgfqpoint{0cm}{1.432cm}}{\pgfqpoint{0cm}{1.395cm}}
\pgfpathcurveto{\pgfqpoint{0cm}{1.359cm}}{\pgfqpoint{0.014cm}{1.324cm}}{\pgfqpoint{0.04cm}{1.299cm}}
\pgfpathcurveto{\pgfqpoint{0.066cm}{1.273cm}}{\pgfqpoint{0.1cm}{1.258cm}}{\pgfqpoint{0.137cm}{1.258cm}}
\pgfpathcurveto{\pgfqpoint{0.173cm}{1.258cm}}{\pgfqpoint{0.207cm}{1.273cm}}{\pgfqpoint{0.233cm}{1.299cm}}
\pgfpathcurveto{\pgfqpoint{0.259cm}{1.324cm}}{\pgfqpoint{0.273cm}{1.359cm}}{\pgfqpoint{0.273cm}{1.395cm}}
\pgfusepath{fill}
\begin{pgfscope}
\pgfsetdash{}{0cm}
\pgfsetlinewidth{0.818mm}
\pgfsetmiterlimit{7.0}
\pgfpathmoveto{\pgfqpoint{0.682cm}{0.671cm}}
\pgfpathlineto{\pgfqpoint{0.679cm}{1.418cm}}
\pgfusepath{stroke}
\end{pgfscope}
\pgfpathmoveto{\pgfqpoint{0.815cm}{1.399cm}}
\pgfpathcurveto{\pgfqpoint{0.815cm}{1.435cm}}{\pgfqpoint{0.801cm}{1.47cm}}{\pgfqpoint{0.775cm}{1.496cm}}
\pgfpathcurveto{\pgfqpoint{0.75cm}{1.521cm}}{\pgfqpoint{0.715cm}{1.536cm}}{\pgfqpoint{0.679cm}{1.536cm}}
\pgfpathcurveto{\pgfqpoint{0.643cm}{1.536cm}}{\pgfqpoint{0.608cm}{1.521cm}}{\pgfqpoint{0.582cm}{1.496cm}}
\pgfpathcurveto{\pgfqpoint{0.557cm}{1.47cm}}{\pgfqpoint{0.542cm}{1.435cm}}{\pgfqpoint{0.542cm}{1.399cm}}
\pgfpathcurveto{\pgfqpoint{0.542cm}{1.363cm}}{\pgfqpoint{0.557cm}{1.328cm}}{\pgfqpoint{0.582cm}{1.302cm}}
\pgfpathcurveto{\pgfqpoint{0.608cm}{1.276cm}}{\pgfqpoint{0.643cm}{1.262cm}}{\pgfqpoint{0.679cm}{1.262cm}}
\pgfpathcurveto{\pgfqpoint{0.715cm}{1.262cm}}{\pgfqpoint{0.75cm}{1.276cm}}{\pgfqpoint{0.775cm}{1.302cm}}
\pgfpathcurveto{\pgfqpoint{0.801cm}{1.328cm}}{\pgfqpoint{0.815cm}{1.363cm}}{\pgfqpoint{0.815cm}{1.399cm}}
\pgfusepath{fill}
\pgfpathmoveto{\pgfqpoint{1.345cm}{1.371cm}}
\pgfpathcurveto{\pgfqpoint{1.345cm}{1.408cm}}{\pgfqpoint{1.331cm}{1.442cm}}{\pgfqpoint{1.305cm}{1.468cm}}
\pgfpathcurveto{\pgfqpoint{1.28cm}{1.494cm}}{\pgfqpoint{1.245cm}{1.508cm}}{\pgfqpoint{1.209cm}{1.508cm}}
\pgfpathcurveto{\pgfqpoint{1.172cm}{1.508cm}}{\pgfqpoint{1.138cm}{1.494cm}}{\pgfqpoint{1.112cm}{1.468cm}}
\pgfpathcurveto{\pgfqpoint{1.087cm}{1.442cm}}{\pgfqpoint{1.072cm}{1.408cm}}{\pgfqpoint{1.072cm}{1.371cm}}
\pgfpathcurveto{\pgfqpoint{1.072cm}{1.335cm}}{\pgfqpoint{1.087cm}{1.3cm}}{\pgfqpoint{1.112cm}{1.274cm}}
\pgfpathcurveto{\pgfqpoint{1.138cm}{1.249cm}}{\pgfqpoint{1.172cm}{1.234cm}}{\pgfqpoint{1.209cm}{1.234cm}}
\pgfpathcurveto{\pgfqpoint{1.245cm}{1.234cm}}{\pgfqpoint{1.28cm}{1.249cm}}{\pgfqpoint{1.305cm}{1.274cm}}
\pgfpathcurveto{\pgfqpoint{1.331cm}{1.3cm}}{\pgfqpoint{1.345cm}{1.335cm}}{\pgfqpoint{1.345cm}{1.371cm}}
\pgfusepath{fill}
\begin{pgfscope}
\pgfsetdash{}{0cm}
\pgfsetlinewidth{0.818mm}
\pgfsetroundcap
\pgfsetmiterlimit{4.0}
\pgfpathmoveto{\pgfqpoint{0.682cm}{0.671cm}}
\pgfpathlineto{\pgfqpoint{0.682cm}{0.042cm}}
\pgfusepath{stroke}
\end{pgfscope}
\end{pgfscope}
\end{pgfscope}
\end{pgfscope}
\end{tikzpicture}}}+\phi)+3b\varphi=- 3X^{\!\resizebox{!}{.8em}{
\begin{tikzpicture}
\pgfpathmoveto{\pgfqpoint{0cm}{-0.035cm}}
\pgfpathlineto{\pgfqpoint{1.976cm}{-0.035cm}}
\pgfpathlineto{\pgfqpoint{1.976cm}{1.94cm}}
\pgfpathlineto{\pgfqpoint{0cm}{1.94cm}}
\pgfpathclose
\pgfusepath{clip}
\begin{pgfscope}
\begin{pgfscope}
\pgfpathmoveto{\pgfqpoint{0cm}{-0.035cm}}
\pgfpathlineto{\pgfqpoint{1.976cm}{-0.035cm}}
\pgfpathlineto{\pgfqpoint{1.976cm}{1.94cm}}
\pgfpathlineto{\pgfqpoint{0cm}{1.94cm}}
\pgfpathclose
\pgfusepath{clip}
\begin{pgfscope}
\begin{pgfscope}
\pgfsetdash{}{0cm}
\pgfsetlinewidth{0.818mm}
\pgfsetroundcap
\pgfsetroundjoin
\pgfsetmiterlimit{7.0}
\definecolor{eps2pgf_color}{gray}{0}\pgfsetstrokecolor{eps2pgf_color}\pgfsetfillcolor{eps2pgf_color}
\pgfpathmoveto{\pgfqpoint{0.117cm}{1.815cm}}
\pgfpathlineto{\pgfqpoint{0.682cm}{1.065cm}}
\pgfpathlineto{\pgfqpoint{1.246cm}{1.815cm}}
\pgfusepath{stroke}
\end{pgfscope}
\definecolor{eps2pgf_color}{gray}{0}\pgfsetstrokecolor{eps2pgf_color}\pgfsetfillcolor{eps2pgf_color}
\pgfpathmoveto{\pgfqpoint{0.273cm}{1.789cm}}
\pgfpathcurveto{\pgfqpoint{0.273cm}{1.825cm}}{\pgfqpoint{0.259cm}{1.86cm}}{\pgfqpoint{0.233cm}{1.886cm}}
\pgfpathcurveto{\pgfqpoint{0.207cm}{1.912cm}}{\pgfqpoint{0.173cm}{1.926cm}}{\pgfqpoint{0.137cm}{1.926cm}}
\pgfpathcurveto{\pgfqpoint{0.1cm}{1.926cm}}{\pgfqpoint{0.066cm}{1.912cm}}{\pgfqpoint{0.04cm}{1.886cm}}
\pgfpathcurveto{\pgfqpoint{0.014cm}{1.86cm}}{\pgfqpoint{0cm}{1.825cm}}{\pgfqpoint{0cm}{1.789cm}}
\pgfpathcurveto{\pgfqpoint{0cm}{1.753cm}}{\pgfqpoint{0.014cm}{1.718cm}}{\pgfqpoint{0.04cm}{1.692cm}}
\pgfpathcurveto{\pgfqpoint{0.066cm}{1.667cm}}{\pgfqpoint{0.1cm}{1.652cm}}{\pgfqpoint{0.137cm}{1.652cm}}
\pgfpathcurveto{\pgfqpoint{0.173cm}{1.652cm}}{\pgfqpoint{0.207cm}{1.667cm}}{\pgfqpoint{0.233cm}{1.692cm}}
\pgfpathcurveto{\pgfqpoint{0.259cm}{1.718cm}}{\pgfqpoint{0.273cm}{1.753cm}}{\pgfqpoint{0.273cm}{1.789cm}}
\pgfusepath{fill}
\begin{pgfscope}
\pgfsetdash{}{0cm}
\pgfsetlinewidth{0.818mm}
\pgfsetmiterlimit{7.0}
\pgfpathmoveto{\pgfqpoint{0.682cm}{1.065cm}}
\pgfpathlineto{\pgfqpoint{0.679cm}{1.812cm}}
\pgfusepath{stroke}
\end{pgfscope}
\pgfpathmoveto{\pgfqpoint{0.815cm}{1.793cm}}
\pgfpathcurveto{\pgfqpoint{0.815cm}{1.829cm}}{\pgfqpoint{0.801cm}{1.864cm}}{\pgfqpoint{0.775cm}{1.89cm}}
\pgfpathcurveto{\pgfqpoint{0.75cm}{1.915cm}}{\pgfqpoint{0.715cm}{1.93cm}}{\pgfqpoint{0.679cm}{1.93cm}}
\pgfpathcurveto{\pgfqpoint{0.643cm}{1.93cm}}{\pgfqpoint{0.608cm}{1.915cm}}{\pgfqpoint{0.582cm}{1.89cm}}
\pgfpathcurveto{\pgfqpoint{0.557cm}{1.864cm}}{\pgfqpoint{0.542cm}{1.829cm}}{\pgfqpoint{0.542cm}{1.793cm}}
\pgfpathcurveto{\pgfqpoint{0.542cm}{1.756cm}}{\pgfqpoint{0.557cm}{1.722cm}}{\pgfqpoint{0.582cm}{1.696cm}}
\pgfpathcurveto{\pgfqpoint{0.608cm}{1.67cm}}{\pgfqpoint{0.643cm}{1.656cm}}{\pgfqpoint{0.679cm}{1.656cm}}
\pgfpathcurveto{\pgfqpoint{0.715cm}{1.656cm}}{\pgfqpoint{0.75cm}{1.67cm}}{\pgfqpoint{0.775cm}{1.696cm}}
\pgfpathcurveto{\pgfqpoint{0.801cm}{1.722cm}}{\pgfqpoint{0.815cm}{1.756cm}}{\pgfqpoint{0.815cm}{1.793cm}}
\pgfusepath{fill}
\pgfpathmoveto{\pgfqpoint{1.345cm}{1.765cm}}
\pgfpathcurveto{\pgfqpoint{1.345cm}{1.801cm}}{\pgfqpoint{1.331cm}{1.836cm}}{\pgfqpoint{1.305cm}{1.862cm}}
\pgfpathcurveto{\pgfqpoint{1.28cm}{1.887cm}}{\pgfqpoint{1.245cm}{1.902cm}}{\pgfqpoint{1.209cm}{1.902cm}}
\pgfpathcurveto{\pgfqpoint{1.172cm}{1.902cm}}{\pgfqpoint{1.138cm}{1.887cm}}{\pgfqpoint{1.112cm}{1.862cm}}
\pgfpathcurveto{\pgfqpoint{1.087cm}{1.836cm}}{\pgfqpoint{1.072cm}{1.801cm}}{\pgfqpoint{1.072cm}{1.765cm}}
\pgfpathcurveto{\pgfqpoint{1.072cm}{1.728cm}}{\pgfqpoint{1.087cm}{1.694cm}}{\pgfqpoint{1.112cm}{1.668cm}}
\pgfpathcurveto{\pgfqpoint{1.138cm}{1.642cm}}{\pgfqpoint{1.172cm}{1.628cm}}{\pgfqpoint{1.209cm}{1.628cm}}
\pgfpathcurveto{\pgfqpoint{1.245cm}{1.628cm}}{\pgfqpoint{1.28cm}{1.642cm}}{\pgfqpoint{1.305cm}{1.668cm}}
\pgfpathcurveto{\pgfqpoint{1.331cm}{1.694cm}}{\pgfqpoint{1.345cm}{1.728cm}}{\pgfqpoint{1.345cm}{1.765cm}}
\pgfusepath{fill}
\begin{pgfscope}
\pgfsetdash{}{0cm}
\pgfsetlinewidth{0.818mm}
\pgfsetroundcap
\pgfsetroundjoin
\pgfsetmiterlimit{7.0}
\pgfpathmoveto{\pgfqpoint{0.682cm}{1.065cm}}
\pgfpathlineto{\pgfqpoint{1.246cm}{0.315cm}}
\pgfpathlineto{\pgfqpoint{1.811cm}{1.065cm}}
\pgfusepath{stroke}
\end{pgfscope}
\pgfpathmoveto{\pgfqpoint{1.948cm}{1.065cm}}
\pgfpathcurveto{\pgfqpoint{1.948cm}{1.101cm}}{\pgfqpoint{1.933cm}{1.136cm}}{\pgfqpoint{1.907cm}{1.162cm}}
\pgfpathcurveto{\pgfqpoint{1.882cm}{1.187cm}}{\pgfqpoint{1.847cm}{1.202cm}}{\pgfqpoint{1.811cm}{1.202cm}}
\pgfpathcurveto{\pgfqpoint{1.775cm}{1.202cm}}{\pgfqpoint{1.74cm}{1.187cm}}{\pgfqpoint{1.714cm}{1.162cm}}
\pgfpathcurveto{\pgfqpoint{1.689cm}{1.136cm}}{\pgfqpoint{1.674cm}{1.101cm}}{\pgfqpoint{1.674cm}{1.065cm}}
\pgfpathcurveto{\pgfqpoint{1.674cm}{1.029cm}}{\pgfqpoint{1.689cm}{0.994cm}}{\pgfqpoint{1.714cm}{0.968cm}}
\pgfpathcurveto{\pgfqpoint{1.74cm}{0.942cm}}{\pgfqpoint{1.775cm}{0.928cm}}{\pgfqpoint{1.811cm}{0.928cm}}
\pgfpathcurveto{\pgfqpoint{1.847cm}{0.928cm}}{\pgfqpoint{1.882cm}{0.942cm}}{\pgfqpoint{1.907cm}{0.968cm}}
\pgfpathcurveto{\pgfqpoint{1.933cm}{0.994cm}}{\pgfqpoint{1.948cm}{1.029cm}}{\pgfqpoint{1.948cm}{1.065cm}}
\pgfusepath{fill}
\begin{pgfscope}
\pgfsetdash{}{0cm}
\pgfsetlinewidth{0.818mm}
\pgfsetmiterlimit{7.0}
\pgfpathmoveto{\pgfqpoint{1.246cm}{0.315cm}}
\pgfpathlineto{\pgfqpoint{1.244cm}{1.061cm}}
\pgfusepath{stroke}
\end{pgfscope}
\pgfpathmoveto{\pgfqpoint{1.38cm}{1.065cm}}
\pgfpathcurveto{\pgfqpoint{1.38cm}{1.101cm}}{\pgfqpoint{1.366cm}{1.136cm}}{\pgfqpoint{1.34cm}{1.162cm}}
\pgfpathcurveto{\pgfqpoint{1.315cm}{1.187cm}}{\pgfqpoint{1.28cm}{1.202cm}}{\pgfqpoint{1.244cm}{1.202cm}}
\pgfpathcurveto{\pgfqpoint{1.207cm}{1.202cm}}{\pgfqpoint{1.173cm}{1.187cm}}{\pgfqpoint{1.147cm}{1.162cm}}
\pgfpathcurveto{\pgfqpoint{1.121cm}{1.136cm}}{\pgfqpoint{1.107cm}{1.101cm}}{\pgfqpoint{1.107cm}{1.065cm}}
\pgfpathcurveto{\pgfqpoint{1.107cm}{1.029cm}}{\pgfqpoint{1.121cm}{0.994cm}}{\pgfqpoint{1.147cm}{0.968cm}}
\pgfpathcurveto{\pgfqpoint{1.173cm}{0.942cm}}{\pgfqpoint{1.207cm}{0.928cm}}{\pgfqpoint{1.244cm}{0.928cm}}
\pgfpathcurveto{\pgfqpoint{1.28cm}{0.928cm}}{\pgfqpoint{1.315cm}{0.942cm}}{\pgfqpoint{1.34cm}{0.968cm}}
\pgfpathcurveto{\pgfqpoint{1.366cm}{0.994cm}}{\pgfqpoint{1.38cm}{1.029cm}}{\pgfqpoint{1.38cm}{1.065cm}}
\pgfusepath{fill}
\begin{pgfscope}
\pgfsetdash{}{0cm}
\pgfsetlinewidth{0.818mm}
\pgfsetmiterlimit{4.0}
\pgfpathmoveto{\pgfqpoint{1.383cm}{0.178cm}}
\pgfpathcurveto{\pgfqpoint{1.383cm}{0.214cm}}{\pgfqpoint{1.369cm}{0.249cm}}{\pgfqpoint{1.343cm}{0.275cm}}
\pgfpathcurveto{\pgfqpoint{1.317cm}{0.3cm}}{\pgfqpoint{1.283cm}{0.315cm}}{\pgfqpoint{1.246cm}{0.315cm}}
\pgfpathcurveto{\pgfqpoint{1.21cm}{0.315cm}}{\pgfqpoint{1.175cm}{0.3cm}}{\pgfqpoint{1.15cm}{0.275cm}}
\pgfpathcurveto{\pgfqpoint{1.124cm}{0.249cm}}{\pgfqpoint{1.11cm}{0.214cm}}{\pgfqpoint{1.11cm}{0.178cm}}
\pgfpathcurveto{\pgfqpoint{1.11cm}{0.141cm}}{\pgfqpoint{1.124cm}{0.107cm}}{\pgfqpoint{1.15cm}{0.081cm}}
\pgfpathcurveto{\pgfqpoint{1.175cm}{0.055cm}}{\pgfqpoint{1.21cm}{0.041cm}}{\pgfqpoint{1.246cm}{0.041cm}}
\pgfpathcurveto{\pgfqpoint{1.283cm}{0.041cm}}{\pgfqpoint{1.317cm}{0.055cm}}{\pgfqpoint{1.343cm}{0.081cm}}
\pgfpathcurveto{\pgfqpoint{1.369cm}{0.107cm}}{\pgfqpoint{1.383cm}{0.141cm}}{\pgfqpoint{1.383cm}{0.178cm}}
\pgfusepath{stroke}
\end{pgfscope}
\end{pgfscope}
\end{pgfscope}
\end{pgfscope}
\end{tikzpicture}}}+3\llbracket X^2 \rrbracket\circ\phi+3b( - X^{\!\resizebox{0.6em}{!}{
\begin{tikzpicture}
\pgfpathmoveto{\pgfqpoint{0cm}{-0.035cm}}
\pgfpathlineto{\pgfqpoint{1.376cm}{-0.035cm}}
\pgfpathlineto{\pgfqpoint{1.376cm}{1.552cm}}
\pgfpathlineto{\pgfqpoint{0cm}{1.552cm}}
\pgfpathclose
\pgfusepath{clip}
\begin{pgfscope}
\begin{pgfscope}
\pgfpathmoveto{\pgfqpoint{0cm}{-0.035cm}}
\pgfpathlineto{\pgfqpoint{1.376cm}{-0.035cm}}
\pgfpathlineto{\pgfqpoint{1.376cm}{1.552cm}}
\pgfpathlineto{\pgfqpoint{0cm}{1.552cm}}
\pgfpathclose
\pgfusepath{clip}
\begin{pgfscope}
\begin{pgfscope}
\pgfsetdash{}{0cm}
\pgfsetlinewidth{0.818mm}
\pgfsetroundcap
\pgfsetroundjoin
\pgfsetmiterlimit{7.0}
\definecolor{eps2pgf_color}{gray}{0}\pgfsetstrokecolor{eps2pgf_color}\pgfsetfillcolor{eps2pgf_color}
\pgfpathmoveto{\pgfqpoint{0.117cm}{1.421cm}}
\pgfpathlineto{\pgfqpoint{0.682cm}{0.671cm}}
\pgfpathlineto{\pgfqpoint{1.246cm}{1.421cm}}
\pgfusepath{stroke}
\end{pgfscope}
\definecolor{eps2pgf_color}{gray}{0}\pgfsetstrokecolor{eps2pgf_color}\pgfsetfillcolor{eps2pgf_color}
\pgfpathmoveto{\pgfqpoint{0.273cm}{1.395cm}}
\pgfpathcurveto{\pgfqpoint{0.273cm}{1.432cm}}{\pgfqpoint{0.259cm}{1.467cm}}{\pgfqpoint{0.233cm}{1.492cm}}
\pgfpathcurveto{\pgfqpoint{0.207cm}{1.518cm}}{\pgfqpoint{0.173cm}{1.532cm}}{\pgfqpoint{0.137cm}{1.532cm}}
\pgfpathcurveto{\pgfqpoint{0.1cm}{1.532cm}}{\pgfqpoint{0.066cm}{1.518cm}}{\pgfqpoint{0.04cm}{1.492cm}}
\pgfpathcurveto{\pgfqpoint{0.014cm}{1.467cm}}{\pgfqpoint{0cm}{1.432cm}}{\pgfqpoint{0cm}{1.395cm}}
\pgfpathcurveto{\pgfqpoint{0cm}{1.359cm}}{\pgfqpoint{0.014cm}{1.324cm}}{\pgfqpoint{0.04cm}{1.299cm}}
\pgfpathcurveto{\pgfqpoint{0.066cm}{1.273cm}}{\pgfqpoint{0.1cm}{1.258cm}}{\pgfqpoint{0.137cm}{1.258cm}}
\pgfpathcurveto{\pgfqpoint{0.173cm}{1.258cm}}{\pgfqpoint{0.207cm}{1.273cm}}{\pgfqpoint{0.233cm}{1.299cm}}
\pgfpathcurveto{\pgfqpoint{0.259cm}{1.324cm}}{\pgfqpoint{0.273cm}{1.359cm}}{\pgfqpoint{0.273cm}{1.395cm}}
\pgfusepath{fill}
\begin{pgfscope}
\pgfsetdash{}{0cm}
\pgfsetlinewidth{0.818mm}
\pgfsetmiterlimit{7.0}
\pgfpathmoveto{\pgfqpoint{0.682cm}{0.671cm}}
\pgfpathlineto{\pgfqpoint{0.679cm}{1.418cm}}
\pgfusepath{stroke}
\end{pgfscope}
\pgfpathmoveto{\pgfqpoint{0.815cm}{1.399cm}}
\pgfpathcurveto{\pgfqpoint{0.815cm}{1.435cm}}{\pgfqpoint{0.801cm}{1.47cm}}{\pgfqpoint{0.775cm}{1.496cm}}
\pgfpathcurveto{\pgfqpoint{0.75cm}{1.521cm}}{\pgfqpoint{0.715cm}{1.536cm}}{\pgfqpoint{0.679cm}{1.536cm}}
\pgfpathcurveto{\pgfqpoint{0.643cm}{1.536cm}}{\pgfqpoint{0.608cm}{1.521cm}}{\pgfqpoint{0.582cm}{1.496cm}}
\pgfpathcurveto{\pgfqpoint{0.557cm}{1.47cm}}{\pgfqpoint{0.542cm}{1.435cm}}{\pgfqpoint{0.542cm}{1.399cm}}
\pgfpathcurveto{\pgfqpoint{0.542cm}{1.363cm}}{\pgfqpoint{0.557cm}{1.328cm}}{\pgfqpoint{0.582cm}{1.302cm}}
\pgfpathcurveto{\pgfqpoint{0.608cm}{1.276cm}}{\pgfqpoint{0.643cm}{1.262cm}}{\pgfqpoint{0.679cm}{1.262cm}}
\pgfpathcurveto{\pgfqpoint{0.715cm}{1.262cm}}{\pgfqpoint{0.75cm}{1.276cm}}{\pgfqpoint{0.775cm}{1.302cm}}
\pgfpathcurveto{\pgfqpoint{0.801cm}{1.328cm}}{\pgfqpoint{0.815cm}{1.363cm}}{\pgfqpoint{0.815cm}{1.399cm}}
\pgfusepath{fill}
\pgfpathmoveto{\pgfqpoint{1.345cm}{1.371cm}}
\pgfpathcurveto{\pgfqpoint{1.345cm}{1.408cm}}{\pgfqpoint{1.331cm}{1.442cm}}{\pgfqpoint{1.305cm}{1.468cm}}
\pgfpathcurveto{\pgfqpoint{1.28cm}{1.494cm}}{\pgfqpoint{1.245cm}{1.508cm}}{\pgfqpoint{1.209cm}{1.508cm}}
\pgfpathcurveto{\pgfqpoint{1.172cm}{1.508cm}}{\pgfqpoint{1.138cm}{1.494cm}}{\pgfqpoint{1.112cm}{1.468cm}}
\pgfpathcurveto{\pgfqpoint{1.087cm}{1.442cm}}{\pgfqpoint{1.072cm}{1.408cm}}{\pgfqpoint{1.072cm}{1.371cm}}
\pgfpathcurveto{\pgfqpoint{1.072cm}{1.335cm}}{\pgfqpoint{1.087cm}{1.3cm}}{\pgfqpoint{1.112cm}{1.274cm}}
\pgfpathcurveto{\pgfqpoint{1.138cm}{1.249cm}}{\pgfqpoint{1.172cm}{1.234cm}}{\pgfqpoint{1.209cm}{1.234cm}}
\pgfpathcurveto{\pgfqpoint{1.245cm}{1.234cm}}{\pgfqpoint{1.28cm}{1.249cm}}{\pgfqpoint{1.305cm}{1.274cm}}
\pgfpathcurveto{\pgfqpoint{1.331cm}{1.3cm}}{\pgfqpoint{1.345cm}{1.335cm}}{\pgfqpoint{1.345cm}{1.371cm}}
\pgfusepath{fill}
\begin{pgfscope}
\pgfsetdash{}{0cm}
\pgfsetlinewidth{0.818mm}
\pgfsetroundcap
\pgfsetmiterlimit{4.0}
\pgfpathmoveto{\pgfqpoint{0.682cm}{0.671cm}}
\pgfpathlineto{\pgfqpoint{0.682cm}{0.042cm}}
\pgfusepath{stroke}
\end{pgfscope}
\end{pgfscope}
\end{pgfscope}
\end{pgfscope}
\end{tikzpicture}}} + \phi + \psi)\\
&\quad= - 3X^{\!\resizebox{!}{.8em}{
\begin{tikzpicture}
\pgfpathmoveto{\pgfqpoint{0cm}{-0.035cm}}
\pgfpathlineto{\pgfqpoint{1.976cm}{-0.035cm}}
\pgfpathlineto{\pgfqpoint{1.976cm}{1.94cm}}
\pgfpathlineto{\pgfqpoint{0cm}{1.94cm}}
\pgfpathclose
\pgfusepath{clip}
\begin{pgfscope}
\begin{pgfscope}
\pgfpathmoveto{\pgfqpoint{0cm}{-0.035cm}}
\pgfpathlineto{\pgfqpoint{1.976cm}{-0.035cm}}
\pgfpathlineto{\pgfqpoint{1.976cm}{1.94cm}}
\pgfpathlineto{\pgfqpoint{0cm}{1.94cm}}
\pgfpathclose
\pgfusepath{clip}
\begin{pgfscope}
\begin{pgfscope}
\pgfsetdash{}{0cm}
\pgfsetlinewidth{0.818mm}
\pgfsetroundcap
\pgfsetroundjoin
\pgfsetmiterlimit{7.0}
\definecolor{eps2pgf_color}{gray}{0}\pgfsetstrokecolor{eps2pgf_color}\pgfsetfillcolor{eps2pgf_color}
\pgfpathmoveto{\pgfqpoint{0.117cm}{1.815cm}}
\pgfpathlineto{\pgfqpoint{0.682cm}{1.065cm}}
\pgfpathlineto{\pgfqpoint{1.246cm}{1.815cm}}
\pgfusepath{stroke}
\end{pgfscope}
\definecolor{eps2pgf_color}{gray}{0}\pgfsetstrokecolor{eps2pgf_color}\pgfsetfillcolor{eps2pgf_color}
\pgfpathmoveto{\pgfqpoint{0.273cm}{1.789cm}}
\pgfpathcurveto{\pgfqpoint{0.273cm}{1.825cm}}{\pgfqpoint{0.259cm}{1.86cm}}{\pgfqpoint{0.233cm}{1.886cm}}
\pgfpathcurveto{\pgfqpoint{0.207cm}{1.912cm}}{\pgfqpoint{0.173cm}{1.926cm}}{\pgfqpoint{0.137cm}{1.926cm}}
\pgfpathcurveto{\pgfqpoint{0.1cm}{1.926cm}}{\pgfqpoint{0.066cm}{1.912cm}}{\pgfqpoint{0.04cm}{1.886cm}}
\pgfpathcurveto{\pgfqpoint{0.014cm}{1.86cm}}{\pgfqpoint{0cm}{1.825cm}}{\pgfqpoint{0cm}{1.789cm}}
\pgfpathcurveto{\pgfqpoint{0cm}{1.753cm}}{\pgfqpoint{0.014cm}{1.718cm}}{\pgfqpoint{0.04cm}{1.692cm}}
\pgfpathcurveto{\pgfqpoint{0.066cm}{1.667cm}}{\pgfqpoint{0.1cm}{1.652cm}}{\pgfqpoint{0.137cm}{1.652cm}}
\pgfpathcurveto{\pgfqpoint{0.173cm}{1.652cm}}{\pgfqpoint{0.207cm}{1.667cm}}{\pgfqpoint{0.233cm}{1.692cm}}
\pgfpathcurveto{\pgfqpoint{0.259cm}{1.718cm}}{\pgfqpoint{0.273cm}{1.753cm}}{\pgfqpoint{0.273cm}{1.789cm}}
\pgfusepath{fill}
\begin{pgfscope}
\pgfsetdash{}{0cm}
\pgfsetlinewidth{0.818mm}
\pgfsetmiterlimit{7.0}
\pgfpathmoveto{\pgfqpoint{0.682cm}{1.065cm}}
\pgfpathlineto{\pgfqpoint{0.679cm}{1.812cm}}
\pgfusepath{stroke}
\end{pgfscope}
\pgfpathmoveto{\pgfqpoint{0.815cm}{1.793cm}}
\pgfpathcurveto{\pgfqpoint{0.815cm}{1.829cm}}{\pgfqpoint{0.801cm}{1.864cm}}{\pgfqpoint{0.775cm}{1.89cm}}
\pgfpathcurveto{\pgfqpoint{0.75cm}{1.915cm}}{\pgfqpoint{0.715cm}{1.93cm}}{\pgfqpoint{0.679cm}{1.93cm}}
\pgfpathcurveto{\pgfqpoint{0.643cm}{1.93cm}}{\pgfqpoint{0.608cm}{1.915cm}}{\pgfqpoint{0.582cm}{1.89cm}}
\pgfpathcurveto{\pgfqpoint{0.557cm}{1.864cm}}{\pgfqpoint{0.542cm}{1.829cm}}{\pgfqpoint{0.542cm}{1.793cm}}
\pgfpathcurveto{\pgfqpoint{0.542cm}{1.756cm}}{\pgfqpoint{0.557cm}{1.722cm}}{\pgfqpoint{0.582cm}{1.696cm}}
\pgfpathcurveto{\pgfqpoint{0.608cm}{1.67cm}}{\pgfqpoint{0.643cm}{1.656cm}}{\pgfqpoint{0.679cm}{1.656cm}}
\pgfpathcurveto{\pgfqpoint{0.715cm}{1.656cm}}{\pgfqpoint{0.75cm}{1.67cm}}{\pgfqpoint{0.775cm}{1.696cm}}
\pgfpathcurveto{\pgfqpoint{0.801cm}{1.722cm}}{\pgfqpoint{0.815cm}{1.756cm}}{\pgfqpoint{0.815cm}{1.793cm}}
\pgfusepath{fill}
\pgfpathmoveto{\pgfqpoint{1.345cm}{1.765cm}}
\pgfpathcurveto{\pgfqpoint{1.345cm}{1.801cm}}{\pgfqpoint{1.331cm}{1.836cm}}{\pgfqpoint{1.305cm}{1.862cm}}
\pgfpathcurveto{\pgfqpoint{1.28cm}{1.887cm}}{\pgfqpoint{1.245cm}{1.902cm}}{\pgfqpoint{1.209cm}{1.902cm}}
\pgfpathcurveto{\pgfqpoint{1.172cm}{1.902cm}}{\pgfqpoint{1.138cm}{1.887cm}}{\pgfqpoint{1.112cm}{1.862cm}}
\pgfpathcurveto{\pgfqpoint{1.087cm}{1.836cm}}{\pgfqpoint{1.072cm}{1.801cm}}{\pgfqpoint{1.072cm}{1.765cm}}
\pgfpathcurveto{\pgfqpoint{1.072cm}{1.728cm}}{\pgfqpoint{1.087cm}{1.694cm}}{\pgfqpoint{1.112cm}{1.668cm}}
\pgfpathcurveto{\pgfqpoint{1.138cm}{1.642cm}}{\pgfqpoint{1.172cm}{1.628cm}}{\pgfqpoint{1.209cm}{1.628cm}}
\pgfpathcurveto{\pgfqpoint{1.245cm}{1.628cm}}{\pgfqpoint{1.28cm}{1.642cm}}{\pgfqpoint{1.305cm}{1.668cm}}
\pgfpathcurveto{\pgfqpoint{1.331cm}{1.694cm}}{\pgfqpoint{1.345cm}{1.728cm}}{\pgfqpoint{1.345cm}{1.765cm}}
\pgfusepath{fill}
\begin{pgfscope}
\pgfsetdash{}{0cm}
\pgfsetlinewidth{0.818mm}
\pgfsetroundcap
\pgfsetroundjoin
\pgfsetmiterlimit{7.0}
\pgfpathmoveto{\pgfqpoint{0.682cm}{1.065cm}}
\pgfpathlineto{\pgfqpoint{1.246cm}{0.315cm}}
\pgfpathlineto{\pgfqpoint{1.811cm}{1.065cm}}
\pgfusepath{stroke}
\end{pgfscope}
\pgfpathmoveto{\pgfqpoint{1.948cm}{1.065cm}}
\pgfpathcurveto{\pgfqpoint{1.948cm}{1.101cm}}{\pgfqpoint{1.933cm}{1.136cm}}{\pgfqpoint{1.907cm}{1.162cm}}
\pgfpathcurveto{\pgfqpoint{1.882cm}{1.187cm}}{\pgfqpoint{1.847cm}{1.202cm}}{\pgfqpoint{1.811cm}{1.202cm}}
\pgfpathcurveto{\pgfqpoint{1.775cm}{1.202cm}}{\pgfqpoint{1.74cm}{1.187cm}}{\pgfqpoint{1.714cm}{1.162cm}}
\pgfpathcurveto{\pgfqpoint{1.689cm}{1.136cm}}{\pgfqpoint{1.674cm}{1.101cm}}{\pgfqpoint{1.674cm}{1.065cm}}
\pgfpathcurveto{\pgfqpoint{1.674cm}{1.029cm}}{\pgfqpoint{1.689cm}{0.994cm}}{\pgfqpoint{1.714cm}{0.968cm}}
\pgfpathcurveto{\pgfqpoint{1.74cm}{0.942cm}}{\pgfqpoint{1.775cm}{0.928cm}}{\pgfqpoint{1.811cm}{0.928cm}}
\pgfpathcurveto{\pgfqpoint{1.847cm}{0.928cm}}{\pgfqpoint{1.882cm}{0.942cm}}{\pgfqpoint{1.907cm}{0.968cm}}
\pgfpathcurveto{\pgfqpoint{1.933cm}{0.994cm}}{\pgfqpoint{1.948cm}{1.029cm}}{\pgfqpoint{1.948cm}{1.065cm}}
\pgfusepath{fill}
\begin{pgfscope}
\pgfsetdash{}{0cm}
\pgfsetlinewidth{0.818mm}
\pgfsetmiterlimit{7.0}
\pgfpathmoveto{\pgfqpoint{1.246cm}{0.315cm}}
\pgfpathlineto{\pgfqpoint{1.244cm}{1.061cm}}
\pgfusepath{stroke}
\end{pgfscope}
\pgfpathmoveto{\pgfqpoint{1.38cm}{1.065cm}}
\pgfpathcurveto{\pgfqpoint{1.38cm}{1.101cm}}{\pgfqpoint{1.366cm}{1.136cm}}{\pgfqpoint{1.34cm}{1.162cm}}
\pgfpathcurveto{\pgfqpoint{1.315cm}{1.187cm}}{\pgfqpoint{1.28cm}{1.202cm}}{\pgfqpoint{1.244cm}{1.202cm}}
\pgfpathcurveto{\pgfqpoint{1.207cm}{1.202cm}}{\pgfqpoint{1.173cm}{1.187cm}}{\pgfqpoint{1.147cm}{1.162cm}}
\pgfpathcurveto{\pgfqpoint{1.121cm}{1.136cm}}{\pgfqpoint{1.107cm}{1.101cm}}{\pgfqpoint{1.107cm}{1.065cm}}
\pgfpathcurveto{\pgfqpoint{1.107cm}{1.029cm}}{\pgfqpoint{1.121cm}{0.994cm}}{\pgfqpoint{1.147cm}{0.968cm}}
\pgfpathcurveto{\pgfqpoint{1.173cm}{0.942cm}}{\pgfqpoint{1.207cm}{0.928cm}}{\pgfqpoint{1.244cm}{0.928cm}}
\pgfpathcurveto{\pgfqpoint{1.28cm}{0.928cm}}{\pgfqpoint{1.315cm}{0.942cm}}{\pgfqpoint{1.34cm}{0.968cm}}
\pgfpathcurveto{\pgfqpoint{1.366cm}{0.994cm}}{\pgfqpoint{1.38cm}{1.029cm}}{\pgfqpoint{1.38cm}{1.065cm}}
\pgfusepath{fill}
\begin{pgfscope}
\pgfsetdash{}{0cm}
\pgfsetlinewidth{0.818mm}
\pgfsetmiterlimit{4.0}
\pgfpathmoveto{\pgfqpoint{1.383cm}{0.178cm}}
\pgfpathcurveto{\pgfqpoint{1.383cm}{0.214cm}}{\pgfqpoint{1.369cm}{0.249cm}}{\pgfqpoint{1.343cm}{0.275cm}}
\pgfpathcurveto{\pgfqpoint{1.317cm}{0.3cm}}{\pgfqpoint{1.283cm}{0.315cm}}{\pgfqpoint{1.246cm}{0.315cm}}
\pgfpathcurveto{\pgfqpoint{1.21cm}{0.315cm}}{\pgfqpoint{1.175cm}{0.3cm}}{\pgfqpoint{1.15cm}{0.275cm}}
\pgfpathcurveto{\pgfqpoint{1.124cm}{0.249cm}}{\pgfqpoint{1.11cm}{0.214cm}}{\pgfqpoint{1.11cm}{0.178cm}}
\pgfpathcurveto{\pgfqpoint{1.11cm}{0.141cm}}{\pgfqpoint{1.124cm}{0.107cm}}{\pgfqpoint{1.15cm}{0.081cm}}
\pgfpathcurveto{\pgfqpoint{1.175cm}{0.055cm}}{\pgfqpoint{1.21cm}{0.041cm}}{\pgfqpoint{1.246cm}{0.041cm}}
\pgfpathcurveto{\pgfqpoint{1.283cm}{0.041cm}}{\pgfqpoint{1.317cm}{0.055cm}}{\pgfqpoint{1.343cm}{0.081cm}}
\pgfpathcurveto{\pgfqpoint{1.369cm}{0.107cm}}{\pgfqpoint{1.383cm}{0.141cm}}{\pgfqpoint{1.383cm}{0.178cm}}
\pgfusepath{stroke}
\end{pgfscope}
\end{pgfscope}
\end{pgfscope}
\end{pgfscope}
\end{tikzpicture}}}+3\llbracket X^2 \rrbracket\circ\vartheta-9\llbracket X^2 \rrbracket\circ(( - X^{\!\resizebox{0.6em}{!}{
\begin{tikzpicture}
\pgfpathmoveto{\pgfqpoint{0cm}{-0.035cm}}
\pgfpathlineto{\pgfqpoint{1.376cm}{-0.035cm}}
\pgfpathlineto{\pgfqpoint{1.376cm}{1.552cm}}
\pgfpathlineto{\pgfqpoint{0cm}{1.552cm}}
\pgfpathclose
\pgfusepath{clip}
\begin{pgfscope}
\begin{pgfscope}
\pgfpathmoveto{\pgfqpoint{0cm}{-0.035cm}}
\pgfpathlineto{\pgfqpoint{1.376cm}{-0.035cm}}
\pgfpathlineto{\pgfqpoint{1.376cm}{1.552cm}}
\pgfpathlineto{\pgfqpoint{0cm}{1.552cm}}
\pgfpathclose
\pgfusepath{clip}
\begin{pgfscope}
\begin{pgfscope}
\pgfsetdash{}{0cm}
\pgfsetlinewidth{0.818mm}
\pgfsetroundcap
\pgfsetroundjoin
\pgfsetmiterlimit{7.0}
\definecolor{eps2pgf_color}{gray}{0}\pgfsetstrokecolor{eps2pgf_color}\pgfsetfillcolor{eps2pgf_color}
\pgfpathmoveto{\pgfqpoint{0.117cm}{1.421cm}}
\pgfpathlineto{\pgfqpoint{0.682cm}{0.671cm}}
\pgfpathlineto{\pgfqpoint{1.246cm}{1.421cm}}
\pgfusepath{stroke}
\end{pgfscope}
\definecolor{eps2pgf_color}{gray}{0}\pgfsetstrokecolor{eps2pgf_color}\pgfsetfillcolor{eps2pgf_color}
\pgfpathmoveto{\pgfqpoint{0.273cm}{1.395cm}}
\pgfpathcurveto{\pgfqpoint{0.273cm}{1.432cm}}{\pgfqpoint{0.259cm}{1.467cm}}{\pgfqpoint{0.233cm}{1.492cm}}
\pgfpathcurveto{\pgfqpoint{0.207cm}{1.518cm}}{\pgfqpoint{0.173cm}{1.532cm}}{\pgfqpoint{0.137cm}{1.532cm}}
\pgfpathcurveto{\pgfqpoint{0.1cm}{1.532cm}}{\pgfqpoint{0.066cm}{1.518cm}}{\pgfqpoint{0.04cm}{1.492cm}}
\pgfpathcurveto{\pgfqpoint{0.014cm}{1.467cm}}{\pgfqpoint{0cm}{1.432cm}}{\pgfqpoint{0cm}{1.395cm}}
\pgfpathcurveto{\pgfqpoint{0cm}{1.359cm}}{\pgfqpoint{0.014cm}{1.324cm}}{\pgfqpoint{0.04cm}{1.299cm}}
\pgfpathcurveto{\pgfqpoint{0.066cm}{1.273cm}}{\pgfqpoint{0.1cm}{1.258cm}}{\pgfqpoint{0.137cm}{1.258cm}}
\pgfpathcurveto{\pgfqpoint{0.173cm}{1.258cm}}{\pgfqpoint{0.207cm}{1.273cm}}{\pgfqpoint{0.233cm}{1.299cm}}
\pgfpathcurveto{\pgfqpoint{0.259cm}{1.324cm}}{\pgfqpoint{0.273cm}{1.359cm}}{\pgfqpoint{0.273cm}{1.395cm}}
\pgfusepath{fill}
\begin{pgfscope}
\pgfsetdash{}{0cm}
\pgfsetlinewidth{0.818mm}
\pgfsetmiterlimit{7.0}
\pgfpathmoveto{\pgfqpoint{0.682cm}{0.671cm}}
\pgfpathlineto{\pgfqpoint{0.679cm}{1.418cm}}
\pgfusepath{stroke}
\end{pgfscope}
\pgfpathmoveto{\pgfqpoint{0.815cm}{1.399cm}}
\pgfpathcurveto{\pgfqpoint{0.815cm}{1.435cm}}{\pgfqpoint{0.801cm}{1.47cm}}{\pgfqpoint{0.775cm}{1.496cm}}
\pgfpathcurveto{\pgfqpoint{0.75cm}{1.521cm}}{\pgfqpoint{0.715cm}{1.536cm}}{\pgfqpoint{0.679cm}{1.536cm}}
\pgfpathcurveto{\pgfqpoint{0.643cm}{1.536cm}}{\pgfqpoint{0.608cm}{1.521cm}}{\pgfqpoint{0.582cm}{1.496cm}}
\pgfpathcurveto{\pgfqpoint{0.557cm}{1.47cm}}{\pgfqpoint{0.542cm}{1.435cm}}{\pgfqpoint{0.542cm}{1.399cm}}
\pgfpathcurveto{\pgfqpoint{0.542cm}{1.363cm}}{\pgfqpoint{0.557cm}{1.328cm}}{\pgfqpoint{0.582cm}{1.302cm}}
\pgfpathcurveto{\pgfqpoint{0.608cm}{1.276cm}}{\pgfqpoint{0.643cm}{1.262cm}}{\pgfqpoint{0.679cm}{1.262cm}}
\pgfpathcurveto{\pgfqpoint{0.715cm}{1.262cm}}{\pgfqpoint{0.75cm}{1.276cm}}{\pgfqpoint{0.775cm}{1.302cm}}
\pgfpathcurveto{\pgfqpoint{0.801cm}{1.328cm}}{\pgfqpoint{0.815cm}{1.363cm}}{\pgfqpoint{0.815cm}{1.399cm}}
\pgfusepath{fill}
\pgfpathmoveto{\pgfqpoint{1.345cm}{1.371cm}}
\pgfpathcurveto{\pgfqpoint{1.345cm}{1.408cm}}{\pgfqpoint{1.331cm}{1.442cm}}{\pgfqpoint{1.305cm}{1.468cm}}
\pgfpathcurveto{\pgfqpoint{1.28cm}{1.494cm}}{\pgfqpoint{1.245cm}{1.508cm}}{\pgfqpoint{1.209cm}{1.508cm}}
\pgfpathcurveto{\pgfqpoint{1.172cm}{1.508cm}}{\pgfqpoint{1.138cm}{1.494cm}}{\pgfqpoint{1.112cm}{1.468cm}}
\pgfpathcurveto{\pgfqpoint{1.087cm}{1.442cm}}{\pgfqpoint{1.072cm}{1.408cm}}{\pgfqpoint{1.072cm}{1.371cm}}
\pgfpathcurveto{\pgfqpoint{1.072cm}{1.335cm}}{\pgfqpoint{1.087cm}{1.3cm}}{\pgfqpoint{1.112cm}{1.274cm}}
\pgfpathcurveto{\pgfqpoint{1.138cm}{1.249cm}}{\pgfqpoint{1.172cm}{1.234cm}}{\pgfqpoint{1.209cm}{1.234cm}}
\pgfpathcurveto{\pgfqpoint{1.245cm}{1.234cm}}{\pgfqpoint{1.28cm}{1.249cm}}{\pgfqpoint{1.305cm}{1.274cm}}
\pgfpathcurveto{\pgfqpoint{1.331cm}{1.3cm}}{\pgfqpoint{1.345cm}{1.335cm}}{\pgfqpoint{1.345cm}{1.371cm}}
\pgfusepath{fill}
\begin{pgfscope}
\pgfsetdash{}{0cm}
\pgfsetlinewidth{0.818mm}
\pgfsetroundcap
\pgfsetmiterlimit{4.0}
\pgfpathmoveto{\pgfqpoint{0.682cm}{0.671cm}}
\pgfpathlineto{\pgfqpoint{0.682cm}{0.042cm}}
\pgfusepath{stroke}
\end{pgfscope}
\end{pgfscope}
\end{pgfscope}
\end{pgfscope}
\end{tikzpicture}}} + \phi + \psi)\prec X^{\!\resizebox{0.6em}{!}{
\begin{tikzpicture}
\pgfpathmoveto{\pgfqpoint{0cm}{0cm}}
\pgfpathlineto{\pgfqpoint{1.376cm}{0cm}}
\pgfpathlineto{\pgfqpoint{1.376cm}{1.588cm}}
\pgfpathlineto{\pgfqpoint{0cm}{1.588cm}}
\pgfpathclose
\pgfusepath{clip}
\begin{pgfscope}
\begin{pgfscope}
\pgfpathmoveto{\pgfqpoint{0cm}{0cm}}
\pgfpathlineto{\pgfqpoint{1.376cm}{0cm}}
\pgfpathlineto{\pgfqpoint{1.376cm}{1.588cm}}
\pgfpathlineto{\pgfqpoint{0cm}{1.588cm}}
\pgfpathclose
\pgfusepath{clip}
\begin{pgfscope}
\begin{pgfscope}
\definecolor{eps2pgf_color}{gray}{0.976471}\pgfsetstrokecolor{eps2pgf_color}\pgfsetfillcolor{eps2pgf_color}
\pgfpathmoveto{\pgfqpoint{0cm}{0cm}}
\pgfpathlineto{\pgfqpoint{1.376cm}{0cm}}
\pgfpathlineto{\pgfqpoint{1.376cm}{1.588cm}}
\pgfpathlineto{\pgfqpoint{0cm}{1.588cm}}
\pgfpathclose
\pgfusepath{fill}
\end{pgfscope}
\begin{pgfscope}
\pgfsetdash{}{0cm}
\pgfsetlinewidth{0.818mm}
\pgfsetroundcap
\pgfsetroundjoin
\pgfsetmiterlimit{7.0}
\definecolor{eps2pgf_color}{gray}{0}\pgfsetstrokecolor{eps2pgf_color}\pgfsetfillcolor{eps2pgf_color}
\pgfpathmoveto{\pgfqpoint{0.117cm}{1.476cm}}
\pgfpathlineto{\pgfqpoint{0.682cm}{0.726cm}}
\pgfpathlineto{\pgfqpoint{1.246cm}{1.476cm}}
\pgfusepath{stroke}
\end{pgfscope}
\definecolor{eps2pgf_color}{gray}{0}\pgfsetstrokecolor{eps2pgf_color}\pgfsetfillcolor{eps2pgf_color}
\pgfpathmoveto{\pgfqpoint{0.273cm}{1.451cm}}
\pgfpathcurveto{\pgfqpoint{0.273cm}{1.487cm}}{\pgfqpoint{0.259cm}{1.522cm}}{\pgfqpoint{0.233cm}{1.547cm}}
\pgfpathcurveto{\pgfqpoint{0.207cm}{1.573cm}}{\pgfqpoint{0.173cm}{1.588cm}}{\pgfqpoint{0.137cm}{1.588cm}}
\pgfpathcurveto{\pgfqpoint{0.1cm}{1.588cm}}{\pgfqpoint{0.066cm}{1.573cm}}{\pgfqpoint{0.04cm}{1.547cm}}
\pgfpathcurveto{\pgfqpoint{0.014cm}{1.522cm}}{\pgfqpoint{0cm}{1.487cm}}{\pgfqpoint{0cm}{1.451cm}}
\pgfpathcurveto{\pgfqpoint{0cm}{1.414cm}}{\pgfqpoint{0.014cm}{1.379cm}}{\pgfqpoint{0.04cm}{1.354cm}}
\pgfpathcurveto{\pgfqpoint{0.066cm}{1.328cm}}{\pgfqpoint{0.1cm}{1.314cm}}{\pgfqpoint{0.137cm}{1.314cm}}
\pgfpathcurveto{\pgfqpoint{0.173cm}{1.314cm}}{\pgfqpoint{0.207cm}{1.328cm}}{\pgfqpoint{0.233cm}{1.354cm}}
\pgfpathcurveto{\pgfqpoint{0.259cm}{1.379cm}}{\pgfqpoint{0.273cm}{1.414cm}}{\pgfqpoint{0.273cm}{1.451cm}}
\pgfusepath{fill}
\pgfpathmoveto{\pgfqpoint{1.345cm}{1.426cm}}
\pgfpathcurveto{\pgfqpoint{1.345cm}{1.463cm}}{\pgfqpoint{1.331cm}{1.497cm}}{\pgfqpoint{1.305cm}{1.523cm}}
\pgfpathcurveto{\pgfqpoint{1.28cm}{1.549cm}}{\pgfqpoint{1.245cm}{1.563cm}}{\pgfqpoint{1.209cm}{1.563cm}}
\pgfpathcurveto{\pgfqpoint{1.172cm}{1.563cm}}{\pgfqpoint{1.138cm}{1.549cm}}{\pgfqpoint{1.112cm}{1.523cm}}
\pgfpathcurveto{\pgfqpoint{1.087cm}{1.497cm}}{\pgfqpoint{1.072cm}{1.463cm}}{\pgfqpoint{1.072cm}{1.426cm}}
\pgfpathcurveto{\pgfqpoint{1.072cm}{1.39cm}}{\pgfqpoint{1.087cm}{1.355cm}}{\pgfqpoint{1.112cm}{1.329cm}}
\pgfpathcurveto{\pgfqpoint{1.138cm}{1.304cm}}{\pgfqpoint{1.172cm}{1.289cm}}{\pgfqpoint{1.209cm}{1.289cm}}
\pgfpathcurveto{\pgfqpoint{1.245cm}{1.289cm}}{\pgfqpoint{1.28cm}{1.304cm}}{\pgfqpoint{1.305cm}{1.329cm}}
\pgfpathcurveto{\pgfqpoint{1.331cm}{1.355cm}}{\pgfqpoint{1.345cm}{1.39cm}}{\pgfqpoint{1.345cm}{1.426cm}}
\pgfusepath{fill}
\begin{pgfscope}
\pgfsetdash{}{0cm}
\pgfsetlinewidth{0.818mm}
\pgfsetroundcap
\pgfsetmiterlimit{4.0}
\pgfpathmoveto{\pgfqpoint{0.682cm}{0.726cm}}
\pgfpathlineto{\pgfqpoint{0.682cm}{0.097cm}}
\pgfusepath{stroke}
\end{pgfscope}
\end{pgfscope}
\end{pgfscope}
\end{pgfscope}
\end{tikzpicture}}})\\
&\qquad\qquad+3b( - X^{\!\resizebox{0.6em}{!}{
\begin{tikzpicture}
\pgfpathmoveto{\pgfqpoint{0cm}{-0.035cm}}
\pgfpathlineto{\pgfqpoint{1.376cm}{-0.035cm}}
\pgfpathlineto{\pgfqpoint{1.376cm}{1.552cm}}
\pgfpathlineto{\pgfqpoint{0cm}{1.552cm}}
\pgfpathclose
\pgfusepath{clip}
\begin{pgfscope}
\begin{pgfscope}
\pgfpathmoveto{\pgfqpoint{0cm}{-0.035cm}}
\pgfpathlineto{\pgfqpoint{1.376cm}{-0.035cm}}
\pgfpathlineto{\pgfqpoint{1.376cm}{1.552cm}}
\pgfpathlineto{\pgfqpoint{0cm}{1.552cm}}
\pgfpathclose
\pgfusepath{clip}
\begin{pgfscope}
\begin{pgfscope}
\pgfsetdash{}{0cm}
\pgfsetlinewidth{0.818mm}
\pgfsetroundcap
\pgfsetroundjoin
\pgfsetmiterlimit{7.0}
\definecolor{eps2pgf_color}{gray}{0}\pgfsetstrokecolor{eps2pgf_color}\pgfsetfillcolor{eps2pgf_color}
\pgfpathmoveto{\pgfqpoint{0.117cm}{1.421cm}}
\pgfpathlineto{\pgfqpoint{0.682cm}{0.671cm}}
\pgfpathlineto{\pgfqpoint{1.246cm}{1.421cm}}
\pgfusepath{stroke}
\end{pgfscope}
\definecolor{eps2pgf_color}{gray}{0}\pgfsetstrokecolor{eps2pgf_color}\pgfsetfillcolor{eps2pgf_color}
\pgfpathmoveto{\pgfqpoint{0.273cm}{1.395cm}}
\pgfpathcurveto{\pgfqpoint{0.273cm}{1.432cm}}{\pgfqpoint{0.259cm}{1.467cm}}{\pgfqpoint{0.233cm}{1.492cm}}
\pgfpathcurveto{\pgfqpoint{0.207cm}{1.518cm}}{\pgfqpoint{0.173cm}{1.532cm}}{\pgfqpoint{0.137cm}{1.532cm}}
\pgfpathcurveto{\pgfqpoint{0.1cm}{1.532cm}}{\pgfqpoint{0.066cm}{1.518cm}}{\pgfqpoint{0.04cm}{1.492cm}}
\pgfpathcurveto{\pgfqpoint{0.014cm}{1.467cm}}{\pgfqpoint{0cm}{1.432cm}}{\pgfqpoint{0cm}{1.395cm}}
\pgfpathcurveto{\pgfqpoint{0cm}{1.359cm}}{\pgfqpoint{0.014cm}{1.324cm}}{\pgfqpoint{0.04cm}{1.299cm}}
\pgfpathcurveto{\pgfqpoint{0.066cm}{1.273cm}}{\pgfqpoint{0.1cm}{1.258cm}}{\pgfqpoint{0.137cm}{1.258cm}}
\pgfpathcurveto{\pgfqpoint{0.173cm}{1.258cm}}{\pgfqpoint{0.207cm}{1.273cm}}{\pgfqpoint{0.233cm}{1.299cm}}
\pgfpathcurveto{\pgfqpoint{0.259cm}{1.324cm}}{\pgfqpoint{0.273cm}{1.359cm}}{\pgfqpoint{0.273cm}{1.395cm}}
\pgfusepath{fill}
\begin{pgfscope}
\pgfsetdash{}{0cm}
\pgfsetlinewidth{0.818mm}
\pgfsetmiterlimit{7.0}
\pgfpathmoveto{\pgfqpoint{0.682cm}{0.671cm}}
\pgfpathlineto{\pgfqpoint{0.679cm}{1.418cm}}
\pgfusepath{stroke}
\end{pgfscope}
\pgfpathmoveto{\pgfqpoint{0.815cm}{1.399cm}}
\pgfpathcurveto{\pgfqpoint{0.815cm}{1.435cm}}{\pgfqpoint{0.801cm}{1.47cm}}{\pgfqpoint{0.775cm}{1.496cm}}
\pgfpathcurveto{\pgfqpoint{0.75cm}{1.521cm}}{\pgfqpoint{0.715cm}{1.536cm}}{\pgfqpoint{0.679cm}{1.536cm}}
\pgfpathcurveto{\pgfqpoint{0.643cm}{1.536cm}}{\pgfqpoint{0.608cm}{1.521cm}}{\pgfqpoint{0.582cm}{1.496cm}}
\pgfpathcurveto{\pgfqpoint{0.557cm}{1.47cm}}{\pgfqpoint{0.542cm}{1.435cm}}{\pgfqpoint{0.542cm}{1.399cm}}
\pgfpathcurveto{\pgfqpoint{0.542cm}{1.363cm}}{\pgfqpoint{0.557cm}{1.328cm}}{\pgfqpoint{0.582cm}{1.302cm}}
\pgfpathcurveto{\pgfqpoint{0.608cm}{1.276cm}}{\pgfqpoint{0.643cm}{1.262cm}}{\pgfqpoint{0.679cm}{1.262cm}}
\pgfpathcurveto{\pgfqpoint{0.715cm}{1.262cm}}{\pgfqpoint{0.75cm}{1.276cm}}{\pgfqpoint{0.775cm}{1.302cm}}
\pgfpathcurveto{\pgfqpoint{0.801cm}{1.328cm}}{\pgfqpoint{0.815cm}{1.363cm}}{\pgfqpoint{0.815cm}{1.399cm}}
\pgfusepath{fill}
\pgfpathmoveto{\pgfqpoint{1.345cm}{1.371cm}}
\pgfpathcurveto{\pgfqpoint{1.345cm}{1.408cm}}{\pgfqpoint{1.331cm}{1.442cm}}{\pgfqpoint{1.305cm}{1.468cm}}
\pgfpathcurveto{\pgfqpoint{1.28cm}{1.494cm}}{\pgfqpoint{1.245cm}{1.508cm}}{\pgfqpoint{1.209cm}{1.508cm}}
\pgfpathcurveto{\pgfqpoint{1.172cm}{1.508cm}}{\pgfqpoint{1.138cm}{1.494cm}}{\pgfqpoint{1.112cm}{1.468cm}}
\pgfpathcurveto{\pgfqpoint{1.087cm}{1.442cm}}{\pgfqpoint{1.072cm}{1.408cm}}{\pgfqpoint{1.072cm}{1.371cm}}
\pgfpathcurveto{\pgfqpoint{1.072cm}{1.335cm}}{\pgfqpoint{1.087cm}{1.3cm}}{\pgfqpoint{1.112cm}{1.274cm}}
\pgfpathcurveto{\pgfqpoint{1.138cm}{1.249cm}}{\pgfqpoint{1.172cm}{1.234cm}}{\pgfqpoint{1.209cm}{1.234cm}}
\pgfpathcurveto{\pgfqpoint{1.245cm}{1.234cm}}{\pgfqpoint{1.28cm}{1.249cm}}{\pgfqpoint{1.305cm}{1.274cm}}
\pgfpathcurveto{\pgfqpoint{1.331cm}{1.3cm}}{\pgfqpoint{1.345cm}{1.335cm}}{\pgfqpoint{1.345cm}{1.371cm}}
\pgfusepath{fill}
\begin{pgfscope}
\pgfsetdash{}{0cm}
\pgfsetlinewidth{0.818mm}
\pgfsetroundcap
\pgfsetmiterlimit{4.0}
\pgfpathmoveto{\pgfqpoint{0.682cm}{0.671cm}}
\pgfpathlineto{\pgfqpoint{0.682cm}{0.042cm}}
\pgfusepath{stroke}
\end{pgfscope}
\end{pgfscope}
\end{pgfscope}
\end{pgfscope}
\end{tikzpicture}}} + \phi + \psi)\\
&\quad= - 3X^{\!\resizebox{!}{.8em}{
\begin{tikzpicture}
\pgfpathmoveto{\pgfqpoint{0cm}{-0.035cm}}
\pgfpathlineto{\pgfqpoint{1.976cm}{-0.035cm}}
\pgfpathlineto{\pgfqpoint{1.976cm}{1.94cm}}
\pgfpathlineto{\pgfqpoint{0cm}{1.94cm}}
\pgfpathclose
\pgfusepath{clip}
\begin{pgfscope}
\begin{pgfscope}
\pgfpathmoveto{\pgfqpoint{0cm}{-0.035cm}}
\pgfpathlineto{\pgfqpoint{1.976cm}{-0.035cm}}
\pgfpathlineto{\pgfqpoint{1.976cm}{1.94cm}}
\pgfpathlineto{\pgfqpoint{0cm}{1.94cm}}
\pgfpathclose
\pgfusepath{clip}
\begin{pgfscope}
\begin{pgfscope}
\pgfsetdash{}{0cm}
\pgfsetlinewidth{0.818mm}
\pgfsetroundcap
\pgfsetroundjoin
\pgfsetmiterlimit{7.0}
\definecolor{eps2pgf_color}{gray}{0}\pgfsetstrokecolor{eps2pgf_color}\pgfsetfillcolor{eps2pgf_color}
\pgfpathmoveto{\pgfqpoint{0.117cm}{1.815cm}}
\pgfpathlineto{\pgfqpoint{0.682cm}{1.065cm}}
\pgfpathlineto{\pgfqpoint{1.246cm}{1.815cm}}
\pgfusepath{stroke}
\end{pgfscope}
\definecolor{eps2pgf_color}{gray}{0}\pgfsetstrokecolor{eps2pgf_color}\pgfsetfillcolor{eps2pgf_color}
\pgfpathmoveto{\pgfqpoint{0.273cm}{1.789cm}}
\pgfpathcurveto{\pgfqpoint{0.273cm}{1.825cm}}{\pgfqpoint{0.259cm}{1.86cm}}{\pgfqpoint{0.233cm}{1.886cm}}
\pgfpathcurveto{\pgfqpoint{0.207cm}{1.912cm}}{\pgfqpoint{0.173cm}{1.926cm}}{\pgfqpoint{0.137cm}{1.926cm}}
\pgfpathcurveto{\pgfqpoint{0.1cm}{1.926cm}}{\pgfqpoint{0.066cm}{1.912cm}}{\pgfqpoint{0.04cm}{1.886cm}}
\pgfpathcurveto{\pgfqpoint{0.014cm}{1.86cm}}{\pgfqpoint{0cm}{1.825cm}}{\pgfqpoint{0cm}{1.789cm}}
\pgfpathcurveto{\pgfqpoint{0cm}{1.753cm}}{\pgfqpoint{0.014cm}{1.718cm}}{\pgfqpoint{0.04cm}{1.692cm}}
\pgfpathcurveto{\pgfqpoint{0.066cm}{1.667cm}}{\pgfqpoint{0.1cm}{1.652cm}}{\pgfqpoint{0.137cm}{1.652cm}}
\pgfpathcurveto{\pgfqpoint{0.173cm}{1.652cm}}{\pgfqpoint{0.207cm}{1.667cm}}{\pgfqpoint{0.233cm}{1.692cm}}
\pgfpathcurveto{\pgfqpoint{0.259cm}{1.718cm}}{\pgfqpoint{0.273cm}{1.753cm}}{\pgfqpoint{0.273cm}{1.789cm}}
\pgfusepath{fill}
\begin{pgfscope}
\pgfsetdash{}{0cm}
\pgfsetlinewidth{0.818mm}
\pgfsetmiterlimit{7.0}
\pgfpathmoveto{\pgfqpoint{0.682cm}{1.065cm}}
\pgfpathlineto{\pgfqpoint{0.679cm}{1.812cm}}
\pgfusepath{stroke}
\end{pgfscope}
\pgfpathmoveto{\pgfqpoint{0.815cm}{1.793cm}}
\pgfpathcurveto{\pgfqpoint{0.815cm}{1.829cm}}{\pgfqpoint{0.801cm}{1.864cm}}{\pgfqpoint{0.775cm}{1.89cm}}
\pgfpathcurveto{\pgfqpoint{0.75cm}{1.915cm}}{\pgfqpoint{0.715cm}{1.93cm}}{\pgfqpoint{0.679cm}{1.93cm}}
\pgfpathcurveto{\pgfqpoint{0.643cm}{1.93cm}}{\pgfqpoint{0.608cm}{1.915cm}}{\pgfqpoint{0.582cm}{1.89cm}}
\pgfpathcurveto{\pgfqpoint{0.557cm}{1.864cm}}{\pgfqpoint{0.542cm}{1.829cm}}{\pgfqpoint{0.542cm}{1.793cm}}
\pgfpathcurveto{\pgfqpoint{0.542cm}{1.756cm}}{\pgfqpoint{0.557cm}{1.722cm}}{\pgfqpoint{0.582cm}{1.696cm}}
\pgfpathcurveto{\pgfqpoint{0.608cm}{1.67cm}}{\pgfqpoint{0.643cm}{1.656cm}}{\pgfqpoint{0.679cm}{1.656cm}}
\pgfpathcurveto{\pgfqpoint{0.715cm}{1.656cm}}{\pgfqpoint{0.75cm}{1.67cm}}{\pgfqpoint{0.775cm}{1.696cm}}
\pgfpathcurveto{\pgfqpoint{0.801cm}{1.722cm}}{\pgfqpoint{0.815cm}{1.756cm}}{\pgfqpoint{0.815cm}{1.793cm}}
\pgfusepath{fill}
\pgfpathmoveto{\pgfqpoint{1.345cm}{1.765cm}}
\pgfpathcurveto{\pgfqpoint{1.345cm}{1.801cm}}{\pgfqpoint{1.331cm}{1.836cm}}{\pgfqpoint{1.305cm}{1.862cm}}
\pgfpathcurveto{\pgfqpoint{1.28cm}{1.887cm}}{\pgfqpoint{1.245cm}{1.902cm}}{\pgfqpoint{1.209cm}{1.902cm}}
\pgfpathcurveto{\pgfqpoint{1.172cm}{1.902cm}}{\pgfqpoint{1.138cm}{1.887cm}}{\pgfqpoint{1.112cm}{1.862cm}}
\pgfpathcurveto{\pgfqpoint{1.087cm}{1.836cm}}{\pgfqpoint{1.072cm}{1.801cm}}{\pgfqpoint{1.072cm}{1.765cm}}
\pgfpathcurveto{\pgfqpoint{1.072cm}{1.728cm}}{\pgfqpoint{1.087cm}{1.694cm}}{\pgfqpoint{1.112cm}{1.668cm}}
\pgfpathcurveto{\pgfqpoint{1.138cm}{1.642cm}}{\pgfqpoint{1.172cm}{1.628cm}}{\pgfqpoint{1.209cm}{1.628cm}}
\pgfpathcurveto{\pgfqpoint{1.245cm}{1.628cm}}{\pgfqpoint{1.28cm}{1.642cm}}{\pgfqpoint{1.305cm}{1.668cm}}
\pgfpathcurveto{\pgfqpoint{1.331cm}{1.694cm}}{\pgfqpoint{1.345cm}{1.728cm}}{\pgfqpoint{1.345cm}{1.765cm}}
\pgfusepath{fill}
\begin{pgfscope}
\pgfsetdash{}{0cm}
\pgfsetlinewidth{0.818mm}
\pgfsetroundcap
\pgfsetroundjoin
\pgfsetmiterlimit{7.0}
\pgfpathmoveto{\pgfqpoint{0.682cm}{1.065cm}}
\pgfpathlineto{\pgfqpoint{1.246cm}{0.315cm}}
\pgfpathlineto{\pgfqpoint{1.811cm}{1.065cm}}
\pgfusepath{stroke}
\end{pgfscope}
\pgfpathmoveto{\pgfqpoint{1.948cm}{1.065cm}}
\pgfpathcurveto{\pgfqpoint{1.948cm}{1.101cm}}{\pgfqpoint{1.933cm}{1.136cm}}{\pgfqpoint{1.907cm}{1.162cm}}
\pgfpathcurveto{\pgfqpoint{1.882cm}{1.187cm}}{\pgfqpoint{1.847cm}{1.202cm}}{\pgfqpoint{1.811cm}{1.202cm}}
\pgfpathcurveto{\pgfqpoint{1.775cm}{1.202cm}}{\pgfqpoint{1.74cm}{1.187cm}}{\pgfqpoint{1.714cm}{1.162cm}}
\pgfpathcurveto{\pgfqpoint{1.689cm}{1.136cm}}{\pgfqpoint{1.674cm}{1.101cm}}{\pgfqpoint{1.674cm}{1.065cm}}
\pgfpathcurveto{\pgfqpoint{1.674cm}{1.029cm}}{\pgfqpoint{1.689cm}{0.994cm}}{\pgfqpoint{1.714cm}{0.968cm}}
\pgfpathcurveto{\pgfqpoint{1.74cm}{0.942cm}}{\pgfqpoint{1.775cm}{0.928cm}}{\pgfqpoint{1.811cm}{0.928cm}}
\pgfpathcurveto{\pgfqpoint{1.847cm}{0.928cm}}{\pgfqpoint{1.882cm}{0.942cm}}{\pgfqpoint{1.907cm}{0.968cm}}
\pgfpathcurveto{\pgfqpoint{1.933cm}{0.994cm}}{\pgfqpoint{1.948cm}{1.029cm}}{\pgfqpoint{1.948cm}{1.065cm}}
\pgfusepath{fill}
\begin{pgfscope}
\pgfsetdash{}{0cm}
\pgfsetlinewidth{0.818mm}
\pgfsetmiterlimit{7.0}
\pgfpathmoveto{\pgfqpoint{1.246cm}{0.315cm}}
\pgfpathlineto{\pgfqpoint{1.244cm}{1.061cm}}
\pgfusepath{stroke}
\end{pgfscope}
\pgfpathmoveto{\pgfqpoint{1.38cm}{1.065cm}}
\pgfpathcurveto{\pgfqpoint{1.38cm}{1.101cm}}{\pgfqpoint{1.366cm}{1.136cm}}{\pgfqpoint{1.34cm}{1.162cm}}
\pgfpathcurveto{\pgfqpoint{1.315cm}{1.187cm}}{\pgfqpoint{1.28cm}{1.202cm}}{\pgfqpoint{1.244cm}{1.202cm}}
\pgfpathcurveto{\pgfqpoint{1.207cm}{1.202cm}}{\pgfqpoint{1.173cm}{1.187cm}}{\pgfqpoint{1.147cm}{1.162cm}}
\pgfpathcurveto{\pgfqpoint{1.121cm}{1.136cm}}{\pgfqpoint{1.107cm}{1.101cm}}{\pgfqpoint{1.107cm}{1.065cm}}
\pgfpathcurveto{\pgfqpoint{1.107cm}{1.029cm}}{\pgfqpoint{1.121cm}{0.994cm}}{\pgfqpoint{1.147cm}{0.968cm}}
\pgfpathcurveto{\pgfqpoint{1.173cm}{0.942cm}}{\pgfqpoint{1.207cm}{0.928cm}}{\pgfqpoint{1.244cm}{0.928cm}}
\pgfpathcurveto{\pgfqpoint{1.28cm}{0.928cm}}{\pgfqpoint{1.315cm}{0.942cm}}{\pgfqpoint{1.34cm}{0.968cm}}
\pgfpathcurveto{\pgfqpoint{1.366cm}{0.994cm}}{\pgfqpoint{1.38cm}{1.029cm}}{\pgfqpoint{1.38cm}{1.065cm}}
\pgfusepath{fill}
\begin{pgfscope}
\pgfsetdash{}{0cm}
\pgfsetlinewidth{0.818mm}
\pgfsetmiterlimit{4.0}
\pgfpathmoveto{\pgfqpoint{1.383cm}{0.178cm}}
\pgfpathcurveto{\pgfqpoint{1.383cm}{0.214cm}}{\pgfqpoint{1.369cm}{0.249cm}}{\pgfqpoint{1.343cm}{0.275cm}}
\pgfpathcurveto{\pgfqpoint{1.317cm}{0.3cm}}{\pgfqpoint{1.283cm}{0.315cm}}{\pgfqpoint{1.246cm}{0.315cm}}
\pgfpathcurveto{\pgfqpoint{1.21cm}{0.315cm}}{\pgfqpoint{1.175cm}{0.3cm}}{\pgfqpoint{1.15cm}{0.275cm}}
\pgfpathcurveto{\pgfqpoint{1.124cm}{0.249cm}}{\pgfqpoint{1.11cm}{0.214cm}}{\pgfqpoint{1.11cm}{0.178cm}}
\pgfpathcurveto{\pgfqpoint{1.11cm}{0.141cm}}{\pgfqpoint{1.124cm}{0.107cm}}{\pgfqpoint{1.15cm}{0.081cm}}
\pgfpathcurveto{\pgfqpoint{1.175cm}{0.055cm}}{\pgfqpoint{1.21cm}{0.041cm}}{\pgfqpoint{1.246cm}{0.041cm}}
\pgfpathcurveto{\pgfqpoint{1.283cm}{0.041cm}}{\pgfqpoint{1.317cm}{0.055cm}}{\pgfqpoint{1.343cm}{0.081cm}}
\pgfpathcurveto{\pgfqpoint{1.369cm}{0.107cm}}{\pgfqpoint{1.383cm}{0.141cm}}{\pgfqpoint{1.383cm}{0.178cm}}
\pgfusepath{stroke}
\end{pgfscope}
\end{pgfscope}
\end{pgfscope}
\end{pgfscope}
\end{tikzpicture}}}+3\llbracket X^2 \rrbracket\circ\vartheta-9( - X^{\!\resizebox{0.6em}{!}{
\begin{tikzpicture}
\pgfpathmoveto{\pgfqpoint{0cm}{-0.035cm}}
\pgfpathlineto{\pgfqpoint{1.376cm}{-0.035cm}}
\pgfpathlineto{\pgfqpoint{1.376cm}{1.552cm}}
\pgfpathlineto{\pgfqpoint{0cm}{1.552cm}}
\pgfpathclose
\pgfusepath{clip}
\begin{pgfscope}
\begin{pgfscope}
\pgfpathmoveto{\pgfqpoint{0cm}{-0.035cm}}
\pgfpathlineto{\pgfqpoint{1.376cm}{-0.035cm}}
\pgfpathlineto{\pgfqpoint{1.376cm}{1.552cm}}
\pgfpathlineto{\pgfqpoint{0cm}{1.552cm}}
\pgfpathclose
\pgfusepath{clip}
\begin{pgfscope}
\begin{pgfscope}
\pgfsetdash{}{0cm}
\pgfsetlinewidth{0.818mm}
\pgfsetroundcap
\pgfsetroundjoin
\pgfsetmiterlimit{7.0}
\definecolor{eps2pgf_color}{gray}{0}\pgfsetstrokecolor{eps2pgf_color}\pgfsetfillcolor{eps2pgf_color}
\pgfpathmoveto{\pgfqpoint{0.117cm}{1.421cm}}
\pgfpathlineto{\pgfqpoint{0.682cm}{0.671cm}}
\pgfpathlineto{\pgfqpoint{1.246cm}{1.421cm}}
\pgfusepath{stroke}
\end{pgfscope}
\definecolor{eps2pgf_color}{gray}{0}\pgfsetstrokecolor{eps2pgf_color}\pgfsetfillcolor{eps2pgf_color}
\pgfpathmoveto{\pgfqpoint{0.273cm}{1.395cm}}
\pgfpathcurveto{\pgfqpoint{0.273cm}{1.432cm}}{\pgfqpoint{0.259cm}{1.467cm}}{\pgfqpoint{0.233cm}{1.492cm}}
\pgfpathcurveto{\pgfqpoint{0.207cm}{1.518cm}}{\pgfqpoint{0.173cm}{1.532cm}}{\pgfqpoint{0.137cm}{1.532cm}}
\pgfpathcurveto{\pgfqpoint{0.1cm}{1.532cm}}{\pgfqpoint{0.066cm}{1.518cm}}{\pgfqpoint{0.04cm}{1.492cm}}
\pgfpathcurveto{\pgfqpoint{0.014cm}{1.467cm}}{\pgfqpoint{0cm}{1.432cm}}{\pgfqpoint{0cm}{1.395cm}}
\pgfpathcurveto{\pgfqpoint{0cm}{1.359cm}}{\pgfqpoint{0.014cm}{1.324cm}}{\pgfqpoint{0.04cm}{1.299cm}}
\pgfpathcurveto{\pgfqpoint{0.066cm}{1.273cm}}{\pgfqpoint{0.1cm}{1.258cm}}{\pgfqpoint{0.137cm}{1.258cm}}
\pgfpathcurveto{\pgfqpoint{0.173cm}{1.258cm}}{\pgfqpoint{0.207cm}{1.273cm}}{\pgfqpoint{0.233cm}{1.299cm}}
\pgfpathcurveto{\pgfqpoint{0.259cm}{1.324cm}}{\pgfqpoint{0.273cm}{1.359cm}}{\pgfqpoint{0.273cm}{1.395cm}}
\pgfusepath{fill}
\begin{pgfscope}
\pgfsetdash{}{0cm}
\pgfsetlinewidth{0.818mm}
\pgfsetmiterlimit{7.0}
\pgfpathmoveto{\pgfqpoint{0.682cm}{0.671cm}}
\pgfpathlineto{\pgfqpoint{0.679cm}{1.418cm}}
\pgfusepath{stroke}
\end{pgfscope}
\pgfpathmoveto{\pgfqpoint{0.815cm}{1.399cm}}
\pgfpathcurveto{\pgfqpoint{0.815cm}{1.435cm}}{\pgfqpoint{0.801cm}{1.47cm}}{\pgfqpoint{0.775cm}{1.496cm}}
\pgfpathcurveto{\pgfqpoint{0.75cm}{1.521cm}}{\pgfqpoint{0.715cm}{1.536cm}}{\pgfqpoint{0.679cm}{1.536cm}}
\pgfpathcurveto{\pgfqpoint{0.643cm}{1.536cm}}{\pgfqpoint{0.608cm}{1.521cm}}{\pgfqpoint{0.582cm}{1.496cm}}
\pgfpathcurveto{\pgfqpoint{0.557cm}{1.47cm}}{\pgfqpoint{0.542cm}{1.435cm}}{\pgfqpoint{0.542cm}{1.399cm}}
\pgfpathcurveto{\pgfqpoint{0.542cm}{1.363cm}}{\pgfqpoint{0.557cm}{1.328cm}}{\pgfqpoint{0.582cm}{1.302cm}}
\pgfpathcurveto{\pgfqpoint{0.608cm}{1.276cm}}{\pgfqpoint{0.643cm}{1.262cm}}{\pgfqpoint{0.679cm}{1.262cm}}
\pgfpathcurveto{\pgfqpoint{0.715cm}{1.262cm}}{\pgfqpoint{0.75cm}{1.276cm}}{\pgfqpoint{0.775cm}{1.302cm}}
\pgfpathcurveto{\pgfqpoint{0.801cm}{1.328cm}}{\pgfqpoint{0.815cm}{1.363cm}}{\pgfqpoint{0.815cm}{1.399cm}}
\pgfusepath{fill}
\pgfpathmoveto{\pgfqpoint{1.345cm}{1.371cm}}
\pgfpathcurveto{\pgfqpoint{1.345cm}{1.408cm}}{\pgfqpoint{1.331cm}{1.442cm}}{\pgfqpoint{1.305cm}{1.468cm}}
\pgfpathcurveto{\pgfqpoint{1.28cm}{1.494cm}}{\pgfqpoint{1.245cm}{1.508cm}}{\pgfqpoint{1.209cm}{1.508cm}}
\pgfpathcurveto{\pgfqpoint{1.172cm}{1.508cm}}{\pgfqpoint{1.138cm}{1.494cm}}{\pgfqpoint{1.112cm}{1.468cm}}
\pgfpathcurveto{\pgfqpoint{1.087cm}{1.442cm}}{\pgfqpoint{1.072cm}{1.408cm}}{\pgfqpoint{1.072cm}{1.371cm}}
\pgfpathcurveto{\pgfqpoint{1.072cm}{1.335cm}}{\pgfqpoint{1.087cm}{1.3cm}}{\pgfqpoint{1.112cm}{1.274cm}}
\pgfpathcurveto{\pgfqpoint{1.138cm}{1.249cm}}{\pgfqpoint{1.172cm}{1.234cm}}{\pgfqpoint{1.209cm}{1.234cm}}
\pgfpathcurveto{\pgfqpoint{1.245cm}{1.234cm}}{\pgfqpoint{1.28cm}{1.249cm}}{\pgfqpoint{1.305cm}{1.274cm}}
\pgfpathcurveto{\pgfqpoint{1.331cm}{1.3cm}}{\pgfqpoint{1.345cm}{1.335cm}}{\pgfqpoint{1.345cm}{1.371cm}}
\pgfusepath{fill}
\begin{pgfscope}
\pgfsetdash{}{0cm}
\pgfsetlinewidth{0.818mm}
\pgfsetroundcap
\pgfsetmiterlimit{4.0}
\pgfpathmoveto{\pgfqpoint{0.682cm}{0.671cm}}
\pgfpathlineto{\pgfqpoint{0.682cm}{0.042cm}}
\pgfusepath{stroke}
\end{pgfscope}
\end{pgfscope}
\end{pgfscope}
\end{pgfscope}
\end{tikzpicture}}} + \phi + \psi)(X^{\!\resizebox{0.6em}{!}{
\begin{tikzpicture}
\pgfpathmoveto{\pgfqpoint{0cm}{0cm}}
\pgfpathlineto{\pgfqpoint{1.376cm}{0cm}}
\pgfpathlineto{\pgfqpoint{1.376cm}{1.588cm}}
\pgfpathlineto{\pgfqpoint{0cm}{1.588cm}}
\pgfpathclose
\pgfusepath{clip}
\begin{pgfscope}
\begin{pgfscope}
\pgfpathmoveto{\pgfqpoint{0cm}{0cm}}
\pgfpathlineto{\pgfqpoint{1.376cm}{0cm}}
\pgfpathlineto{\pgfqpoint{1.376cm}{1.588cm}}
\pgfpathlineto{\pgfqpoint{0cm}{1.588cm}}
\pgfpathclose
\pgfusepath{clip}
\begin{pgfscope}
\begin{pgfscope}
\definecolor{eps2pgf_color}{gray}{0.976471}\pgfsetstrokecolor{eps2pgf_color}\pgfsetfillcolor{eps2pgf_color}
\pgfpathmoveto{\pgfqpoint{0cm}{0cm}}
\pgfpathlineto{\pgfqpoint{1.376cm}{0cm}}
\pgfpathlineto{\pgfqpoint{1.376cm}{1.588cm}}
\pgfpathlineto{\pgfqpoint{0cm}{1.588cm}}
\pgfpathclose
\pgfusepath{fill}
\end{pgfscope}
\begin{pgfscope}
\pgfsetdash{}{0cm}
\pgfsetlinewidth{0.818mm}
\pgfsetroundcap
\pgfsetroundjoin
\pgfsetmiterlimit{7.0}
\definecolor{eps2pgf_color}{gray}{0}\pgfsetstrokecolor{eps2pgf_color}\pgfsetfillcolor{eps2pgf_color}
\pgfpathmoveto{\pgfqpoint{0.117cm}{1.476cm}}
\pgfpathlineto{\pgfqpoint{0.682cm}{0.726cm}}
\pgfpathlineto{\pgfqpoint{1.246cm}{1.476cm}}
\pgfusepath{stroke}
\end{pgfscope}
\definecolor{eps2pgf_color}{gray}{0}\pgfsetstrokecolor{eps2pgf_color}\pgfsetfillcolor{eps2pgf_color}
\pgfpathmoveto{\pgfqpoint{0.273cm}{1.451cm}}
\pgfpathcurveto{\pgfqpoint{0.273cm}{1.487cm}}{\pgfqpoint{0.259cm}{1.522cm}}{\pgfqpoint{0.233cm}{1.547cm}}
\pgfpathcurveto{\pgfqpoint{0.207cm}{1.573cm}}{\pgfqpoint{0.173cm}{1.588cm}}{\pgfqpoint{0.137cm}{1.588cm}}
\pgfpathcurveto{\pgfqpoint{0.1cm}{1.588cm}}{\pgfqpoint{0.066cm}{1.573cm}}{\pgfqpoint{0.04cm}{1.547cm}}
\pgfpathcurveto{\pgfqpoint{0.014cm}{1.522cm}}{\pgfqpoint{0cm}{1.487cm}}{\pgfqpoint{0cm}{1.451cm}}
\pgfpathcurveto{\pgfqpoint{0cm}{1.414cm}}{\pgfqpoint{0.014cm}{1.379cm}}{\pgfqpoint{0.04cm}{1.354cm}}
\pgfpathcurveto{\pgfqpoint{0.066cm}{1.328cm}}{\pgfqpoint{0.1cm}{1.314cm}}{\pgfqpoint{0.137cm}{1.314cm}}
\pgfpathcurveto{\pgfqpoint{0.173cm}{1.314cm}}{\pgfqpoint{0.207cm}{1.328cm}}{\pgfqpoint{0.233cm}{1.354cm}}
\pgfpathcurveto{\pgfqpoint{0.259cm}{1.379cm}}{\pgfqpoint{0.273cm}{1.414cm}}{\pgfqpoint{0.273cm}{1.451cm}}
\pgfusepath{fill}
\pgfpathmoveto{\pgfqpoint{1.345cm}{1.426cm}}
\pgfpathcurveto{\pgfqpoint{1.345cm}{1.463cm}}{\pgfqpoint{1.331cm}{1.497cm}}{\pgfqpoint{1.305cm}{1.523cm}}
\pgfpathcurveto{\pgfqpoint{1.28cm}{1.549cm}}{\pgfqpoint{1.245cm}{1.563cm}}{\pgfqpoint{1.209cm}{1.563cm}}
\pgfpathcurveto{\pgfqpoint{1.172cm}{1.563cm}}{\pgfqpoint{1.138cm}{1.549cm}}{\pgfqpoint{1.112cm}{1.523cm}}
\pgfpathcurveto{\pgfqpoint{1.087cm}{1.497cm}}{\pgfqpoint{1.072cm}{1.463cm}}{\pgfqpoint{1.072cm}{1.426cm}}
\pgfpathcurveto{\pgfqpoint{1.072cm}{1.39cm}}{\pgfqpoint{1.087cm}{1.355cm}}{\pgfqpoint{1.112cm}{1.329cm}}
\pgfpathcurveto{\pgfqpoint{1.138cm}{1.304cm}}{\pgfqpoint{1.172cm}{1.289cm}}{\pgfqpoint{1.209cm}{1.289cm}}
\pgfpathcurveto{\pgfqpoint{1.245cm}{1.289cm}}{\pgfqpoint{1.28cm}{1.304cm}}{\pgfqpoint{1.305cm}{1.329cm}}
\pgfpathcurveto{\pgfqpoint{1.331cm}{1.355cm}}{\pgfqpoint{1.345cm}{1.39cm}}{\pgfqpoint{1.345cm}{1.426cm}}
\pgfusepath{fill}
\begin{pgfscope}
\pgfsetdash{}{0cm}
\pgfsetlinewidth{0.818mm}
\pgfsetroundcap
\pgfsetmiterlimit{4.0}
\pgfpathmoveto{\pgfqpoint{0.682cm}{0.726cm}}
\pgfpathlineto{\pgfqpoint{0.682cm}{0.097cm}}
\pgfusepath{stroke}
\end{pgfscope}
\end{pgfscope}
\end{pgfscope}
\end{pgfscope}
\end{tikzpicture}}}\circ\llbracket X^2 \rrbracket)\\
&\qquad\qquad-9\mathrm{com}(-X^{\!\resizebox{0.6em}{!}{
\begin{tikzpicture}
\pgfpathmoveto{\pgfqpoint{0cm}{-0.035cm}}
\pgfpathlineto{\pgfqpoint{1.376cm}{-0.035cm}}
\pgfpathlineto{\pgfqpoint{1.376cm}{1.552cm}}
\pgfpathlineto{\pgfqpoint{0cm}{1.552cm}}
\pgfpathclose
\pgfusepath{clip}
\begin{pgfscope}
\begin{pgfscope}
\pgfpathmoveto{\pgfqpoint{0cm}{-0.035cm}}
\pgfpathlineto{\pgfqpoint{1.376cm}{-0.035cm}}
\pgfpathlineto{\pgfqpoint{1.376cm}{1.552cm}}
\pgfpathlineto{\pgfqpoint{0cm}{1.552cm}}
\pgfpathclose
\pgfusepath{clip}
\begin{pgfscope}
\begin{pgfscope}
\pgfsetdash{}{0cm}
\pgfsetlinewidth{0.818mm}
\pgfsetroundcap
\pgfsetroundjoin
\pgfsetmiterlimit{7.0}
\definecolor{eps2pgf_color}{gray}{0}\pgfsetstrokecolor{eps2pgf_color}\pgfsetfillcolor{eps2pgf_color}
\pgfpathmoveto{\pgfqpoint{0.117cm}{1.421cm}}
\pgfpathlineto{\pgfqpoint{0.682cm}{0.671cm}}
\pgfpathlineto{\pgfqpoint{1.246cm}{1.421cm}}
\pgfusepath{stroke}
\end{pgfscope}
\definecolor{eps2pgf_color}{gray}{0}\pgfsetstrokecolor{eps2pgf_color}\pgfsetfillcolor{eps2pgf_color}
\pgfpathmoveto{\pgfqpoint{0.273cm}{1.395cm}}
\pgfpathcurveto{\pgfqpoint{0.273cm}{1.432cm}}{\pgfqpoint{0.259cm}{1.467cm}}{\pgfqpoint{0.233cm}{1.492cm}}
\pgfpathcurveto{\pgfqpoint{0.207cm}{1.518cm}}{\pgfqpoint{0.173cm}{1.532cm}}{\pgfqpoint{0.137cm}{1.532cm}}
\pgfpathcurveto{\pgfqpoint{0.1cm}{1.532cm}}{\pgfqpoint{0.066cm}{1.518cm}}{\pgfqpoint{0.04cm}{1.492cm}}
\pgfpathcurveto{\pgfqpoint{0.014cm}{1.467cm}}{\pgfqpoint{0cm}{1.432cm}}{\pgfqpoint{0cm}{1.395cm}}
\pgfpathcurveto{\pgfqpoint{0cm}{1.359cm}}{\pgfqpoint{0.014cm}{1.324cm}}{\pgfqpoint{0.04cm}{1.299cm}}
\pgfpathcurveto{\pgfqpoint{0.066cm}{1.273cm}}{\pgfqpoint{0.1cm}{1.258cm}}{\pgfqpoint{0.137cm}{1.258cm}}
\pgfpathcurveto{\pgfqpoint{0.173cm}{1.258cm}}{\pgfqpoint{0.207cm}{1.273cm}}{\pgfqpoint{0.233cm}{1.299cm}}
\pgfpathcurveto{\pgfqpoint{0.259cm}{1.324cm}}{\pgfqpoint{0.273cm}{1.359cm}}{\pgfqpoint{0.273cm}{1.395cm}}
\pgfusepath{fill}
\begin{pgfscope}
\pgfsetdash{}{0cm}
\pgfsetlinewidth{0.818mm}
\pgfsetmiterlimit{7.0}
\pgfpathmoveto{\pgfqpoint{0.682cm}{0.671cm}}
\pgfpathlineto{\pgfqpoint{0.679cm}{1.418cm}}
\pgfusepath{stroke}
\end{pgfscope}
\pgfpathmoveto{\pgfqpoint{0.815cm}{1.399cm}}
\pgfpathcurveto{\pgfqpoint{0.815cm}{1.435cm}}{\pgfqpoint{0.801cm}{1.47cm}}{\pgfqpoint{0.775cm}{1.496cm}}
\pgfpathcurveto{\pgfqpoint{0.75cm}{1.521cm}}{\pgfqpoint{0.715cm}{1.536cm}}{\pgfqpoint{0.679cm}{1.536cm}}
\pgfpathcurveto{\pgfqpoint{0.643cm}{1.536cm}}{\pgfqpoint{0.608cm}{1.521cm}}{\pgfqpoint{0.582cm}{1.496cm}}
\pgfpathcurveto{\pgfqpoint{0.557cm}{1.47cm}}{\pgfqpoint{0.542cm}{1.435cm}}{\pgfqpoint{0.542cm}{1.399cm}}
\pgfpathcurveto{\pgfqpoint{0.542cm}{1.363cm}}{\pgfqpoint{0.557cm}{1.328cm}}{\pgfqpoint{0.582cm}{1.302cm}}
\pgfpathcurveto{\pgfqpoint{0.608cm}{1.276cm}}{\pgfqpoint{0.643cm}{1.262cm}}{\pgfqpoint{0.679cm}{1.262cm}}
\pgfpathcurveto{\pgfqpoint{0.715cm}{1.262cm}}{\pgfqpoint{0.75cm}{1.276cm}}{\pgfqpoint{0.775cm}{1.302cm}}
\pgfpathcurveto{\pgfqpoint{0.801cm}{1.328cm}}{\pgfqpoint{0.815cm}{1.363cm}}{\pgfqpoint{0.815cm}{1.399cm}}
\pgfusepath{fill}
\pgfpathmoveto{\pgfqpoint{1.345cm}{1.371cm}}
\pgfpathcurveto{\pgfqpoint{1.345cm}{1.408cm}}{\pgfqpoint{1.331cm}{1.442cm}}{\pgfqpoint{1.305cm}{1.468cm}}
\pgfpathcurveto{\pgfqpoint{1.28cm}{1.494cm}}{\pgfqpoint{1.245cm}{1.508cm}}{\pgfqpoint{1.209cm}{1.508cm}}
\pgfpathcurveto{\pgfqpoint{1.172cm}{1.508cm}}{\pgfqpoint{1.138cm}{1.494cm}}{\pgfqpoint{1.112cm}{1.468cm}}
\pgfpathcurveto{\pgfqpoint{1.087cm}{1.442cm}}{\pgfqpoint{1.072cm}{1.408cm}}{\pgfqpoint{1.072cm}{1.371cm}}
\pgfpathcurveto{\pgfqpoint{1.072cm}{1.335cm}}{\pgfqpoint{1.087cm}{1.3cm}}{\pgfqpoint{1.112cm}{1.274cm}}
\pgfpathcurveto{\pgfqpoint{1.138cm}{1.249cm}}{\pgfqpoint{1.172cm}{1.234cm}}{\pgfqpoint{1.209cm}{1.234cm}}
\pgfpathcurveto{\pgfqpoint{1.245cm}{1.234cm}}{\pgfqpoint{1.28cm}{1.249cm}}{\pgfqpoint{1.305cm}{1.274cm}}
\pgfpathcurveto{\pgfqpoint{1.331cm}{1.3cm}}{\pgfqpoint{1.345cm}{1.335cm}}{\pgfqpoint{1.345cm}{1.371cm}}
\pgfusepath{fill}
\begin{pgfscope}
\pgfsetdash{}{0cm}
\pgfsetlinewidth{0.818mm}
\pgfsetroundcap
\pgfsetmiterlimit{4.0}
\pgfpathmoveto{\pgfqpoint{0.682cm}{0.671cm}}
\pgfpathlineto{\pgfqpoint{0.682cm}{0.042cm}}
\pgfusepath{stroke}
\end{pgfscope}
\end{pgfscope}
\end{pgfscope}
\end{pgfscope}
\end{tikzpicture}}} + \phi + \psi,X^{\!\resizebox{0.6em}{!}{
\begin{tikzpicture}
\pgfpathmoveto{\pgfqpoint{0cm}{0cm}}
\pgfpathlineto{\pgfqpoint{1.376cm}{0cm}}
\pgfpathlineto{\pgfqpoint{1.376cm}{1.588cm}}
\pgfpathlineto{\pgfqpoint{0cm}{1.588cm}}
\pgfpathclose
\pgfusepath{clip}
\begin{pgfscope}
\begin{pgfscope}
\pgfpathmoveto{\pgfqpoint{0cm}{0cm}}
\pgfpathlineto{\pgfqpoint{1.376cm}{0cm}}
\pgfpathlineto{\pgfqpoint{1.376cm}{1.588cm}}
\pgfpathlineto{\pgfqpoint{0cm}{1.588cm}}
\pgfpathclose
\pgfusepath{clip}
\begin{pgfscope}
\begin{pgfscope}
\definecolor{eps2pgf_color}{gray}{0.976471}\pgfsetstrokecolor{eps2pgf_color}\pgfsetfillcolor{eps2pgf_color}
\pgfpathmoveto{\pgfqpoint{0cm}{0cm}}
\pgfpathlineto{\pgfqpoint{1.376cm}{0cm}}
\pgfpathlineto{\pgfqpoint{1.376cm}{1.588cm}}
\pgfpathlineto{\pgfqpoint{0cm}{1.588cm}}
\pgfpathclose
\pgfusepath{fill}
\end{pgfscope}
\begin{pgfscope}
\pgfsetdash{}{0cm}
\pgfsetlinewidth{0.818mm}
\pgfsetroundcap
\pgfsetroundjoin
\pgfsetmiterlimit{7.0}
\definecolor{eps2pgf_color}{gray}{0}\pgfsetstrokecolor{eps2pgf_color}\pgfsetfillcolor{eps2pgf_color}
\pgfpathmoveto{\pgfqpoint{0.117cm}{1.476cm}}
\pgfpathlineto{\pgfqpoint{0.682cm}{0.726cm}}
\pgfpathlineto{\pgfqpoint{1.246cm}{1.476cm}}
\pgfusepath{stroke}
\end{pgfscope}
\definecolor{eps2pgf_color}{gray}{0}\pgfsetstrokecolor{eps2pgf_color}\pgfsetfillcolor{eps2pgf_color}
\pgfpathmoveto{\pgfqpoint{0.273cm}{1.451cm}}
\pgfpathcurveto{\pgfqpoint{0.273cm}{1.487cm}}{\pgfqpoint{0.259cm}{1.522cm}}{\pgfqpoint{0.233cm}{1.547cm}}
\pgfpathcurveto{\pgfqpoint{0.207cm}{1.573cm}}{\pgfqpoint{0.173cm}{1.588cm}}{\pgfqpoint{0.137cm}{1.588cm}}
\pgfpathcurveto{\pgfqpoint{0.1cm}{1.588cm}}{\pgfqpoint{0.066cm}{1.573cm}}{\pgfqpoint{0.04cm}{1.547cm}}
\pgfpathcurveto{\pgfqpoint{0.014cm}{1.522cm}}{\pgfqpoint{0cm}{1.487cm}}{\pgfqpoint{0cm}{1.451cm}}
\pgfpathcurveto{\pgfqpoint{0cm}{1.414cm}}{\pgfqpoint{0.014cm}{1.379cm}}{\pgfqpoint{0.04cm}{1.354cm}}
\pgfpathcurveto{\pgfqpoint{0.066cm}{1.328cm}}{\pgfqpoint{0.1cm}{1.314cm}}{\pgfqpoint{0.137cm}{1.314cm}}
\pgfpathcurveto{\pgfqpoint{0.173cm}{1.314cm}}{\pgfqpoint{0.207cm}{1.328cm}}{\pgfqpoint{0.233cm}{1.354cm}}
\pgfpathcurveto{\pgfqpoint{0.259cm}{1.379cm}}{\pgfqpoint{0.273cm}{1.414cm}}{\pgfqpoint{0.273cm}{1.451cm}}
\pgfusepath{fill}
\pgfpathmoveto{\pgfqpoint{1.345cm}{1.426cm}}
\pgfpathcurveto{\pgfqpoint{1.345cm}{1.463cm}}{\pgfqpoint{1.331cm}{1.497cm}}{\pgfqpoint{1.305cm}{1.523cm}}
\pgfpathcurveto{\pgfqpoint{1.28cm}{1.549cm}}{\pgfqpoint{1.245cm}{1.563cm}}{\pgfqpoint{1.209cm}{1.563cm}}
\pgfpathcurveto{\pgfqpoint{1.172cm}{1.563cm}}{\pgfqpoint{1.138cm}{1.549cm}}{\pgfqpoint{1.112cm}{1.523cm}}
\pgfpathcurveto{\pgfqpoint{1.087cm}{1.497cm}}{\pgfqpoint{1.072cm}{1.463cm}}{\pgfqpoint{1.072cm}{1.426cm}}
\pgfpathcurveto{\pgfqpoint{1.072cm}{1.39cm}}{\pgfqpoint{1.087cm}{1.355cm}}{\pgfqpoint{1.112cm}{1.329cm}}
\pgfpathcurveto{\pgfqpoint{1.138cm}{1.304cm}}{\pgfqpoint{1.172cm}{1.289cm}}{\pgfqpoint{1.209cm}{1.289cm}}
\pgfpathcurveto{\pgfqpoint{1.245cm}{1.289cm}}{\pgfqpoint{1.28cm}{1.304cm}}{\pgfqpoint{1.305cm}{1.329cm}}
\pgfpathcurveto{\pgfqpoint{1.331cm}{1.355cm}}{\pgfqpoint{1.345cm}{1.39cm}}{\pgfqpoint{1.345cm}{1.426cm}}
\pgfusepath{fill}
\begin{pgfscope}
\pgfsetdash{}{0cm}
\pgfsetlinewidth{0.818mm}
\pgfsetroundcap
\pgfsetmiterlimit{4.0}
\pgfpathmoveto{\pgfqpoint{0.682cm}{0.726cm}}
\pgfpathlineto{\pgfqpoint{0.682cm}{0.097cm}}
\pgfusepath{stroke}
\end{pgfscope}
\end{pgfscope}
\end{pgfscope}
\end{pgfscope}
\end{tikzpicture}}},\llbracket X^2 \rrbracket)+3b( - X^{\!\resizebox{0.6em}{!}{
\begin{tikzpicture}
\pgfpathmoveto{\pgfqpoint{0cm}{-0.035cm}}
\pgfpathlineto{\pgfqpoint{1.376cm}{-0.035cm}}
\pgfpathlineto{\pgfqpoint{1.376cm}{1.552cm}}
\pgfpathlineto{\pgfqpoint{0cm}{1.552cm}}
\pgfpathclose
\pgfusepath{clip}
\begin{pgfscope}
\begin{pgfscope}
\pgfpathmoveto{\pgfqpoint{0cm}{-0.035cm}}
\pgfpathlineto{\pgfqpoint{1.376cm}{-0.035cm}}
\pgfpathlineto{\pgfqpoint{1.376cm}{1.552cm}}
\pgfpathlineto{\pgfqpoint{0cm}{1.552cm}}
\pgfpathclose
\pgfusepath{clip}
\begin{pgfscope}
\begin{pgfscope}
\pgfsetdash{}{0cm}
\pgfsetlinewidth{0.818mm}
\pgfsetroundcap
\pgfsetroundjoin
\pgfsetmiterlimit{7.0}
\definecolor{eps2pgf_color}{gray}{0}\pgfsetstrokecolor{eps2pgf_color}\pgfsetfillcolor{eps2pgf_color}
\pgfpathmoveto{\pgfqpoint{0.117cm}{1.421cm}}
\pgfpathlineto{\pgfqpoint{0.682cm}{0.671cm}}
\pgfpathlineto{\pgfqpoint{1.246cm}{1.421cm}}
\pgfusepath{stroke}
\end{pgfscope}
\definecolor{eps2pgf_color}{gray}{0}\pgfsetstrokecolor{eps2pgf_color}\pgfsetfillcolor{eps2pgf_color}
\pgfpathmoveto{\pgfqpoint{0.273cm}{1.395cm}}
\pgfpathcurveto{\pgfqpoint{0.273cm}{1.432cm}}{\pgfqpoint{0.259cm}{1.467cm}}{\pgfqpoint{0.233cm}{1.492cm}}
\pgfpathcurveto{\pgfqpoint{0.207cm}{1.518cm}}{\pgfqpoint{0.173cm}{1.532cm}}{\pgfqpoint{0.137cm}{1.532cm}}
\pgfpathcurveto{\pgfqpoint{0.1cm}{1.532cm}}{\pgfqpoint{0.066cm}{1.518cm}}{\pgfqpoint{0.04cm}{1.492cm}}
\pgfpathcurveto{\pgfqpoint{0.014cm}{1.467cm}}{\pgfqpoint{0cm}{1.432cm}}{\pgfqpoint{0cm}{1.395cm}}
\pgfpathcurveto{\pgfqpoint{0cm}{1.359cm}}{\pgfqpoint{0.014cm}{1.324cm}}{\pgfqpoint{0.04cm}{1.299cm}}
\pgfpathcurveto{\pgfqpoint{0.066cm}{1.273cm}}{\pgfqpoint{0.1cm}{1.258cm}}{\pgfqpoint{0.137cm}{1.258cm}}
\pgfpathcurveto{\pgfqpoint{0.173cm}{1.258cm}}{\pgfqpoint{0.207cm}{1.273cm}}{\pgfqpoint{0.233cm}{1.299cm}}
\pgfpathcurveto{\pgfqpoint{0.259cm}{1.324cm}}{\pgfqpoint{0.273cm}{1.359cm}}{\pgfqpoint{0.273cm}{1.395cm}}
\pgfusepath{fill}
\begin{pgfscope}
\pgfsetdash{}{0cm}
\pgfsetlinewidth{0.818mm}
\pgfsetmiterlimit{7.0}
\pgfpathmoveto{\pgfqpoint{0.682cm}{0.671cm}}
\pgfpathlineto{\pgfqpoint{0.679cm}{1.418cm}}
\pgfusepath{stroke}
\end{pgfscope}
\pgfpathmoveto{\pgfqpoint{0.815cm}{1.399cm}}
\pgfpathcurveto{\pgfqpoint{0.815cm}{1.435cm}}{\pgfqpoint{0.801cm}{1.47cm}}{\pgfqpoint{0.775cm}{1.496cm}}
\pgfpathcurveto{\pgfqpoint{0.75cm}{1.521cm}}{\pgfqpoint{0.715cm}{1.536cm}}{\pgfqpoint{0.679cm}{1.536cm}}
\pgfpathcurveto{\pgfqpoint{0.643cm}{1.536cm}}{\pgfqpoint{0.608cm}{1.521cm}}{\pgfqpoint{0.582cm}{1.496cm}}
\pgfpathcurveto{\pgfqpoint{0.557cm}{1.47cm}}{\pgfqpoint{0.542cm}{1.435cm}}{\pgfqpoint{0.542cm}{1.399cm}}
\pgfpathcurveto{\pgfqpoint{0.542cm}{1.363cm}}{\pgfqpoint{0.557cm}{1.328cm}}{\pgfqpoint{0.582cm}{1.302cm}}
\pgfpathcurveto{\pgfqpoint{0.608cm}{1.276cm}}{\pgfqpoint{0.643cm}{1.262cm}}{\pgfqpoint{0.679cm}{1.262cm}}
\pgfpathcurveto{\pgfqpoint{0.715cm}{1.262cm}}{\pgfqpoint{0.75cm}{1.276cm}}{\pgfqpoint{0.775cm}{1.302cm}}
\pgfpathcurveto{\pgfqpoint{0.801cm}{1.328cm}}{\pgfqpoint{0.815cm}{1.363cm}}{\pgfqpoint{0.815cm}{1.399cm}}
\pgfusepath{fill}
\pgfpathmoveto{\pgfqpoint{1.345cm}{1.371cm}}
\pgfpathcurveto{\pgfqpoint{1.345cm}{1.408cm}}{\pgfqpoint{1.331cm}{1.442cm}}{\pgfqpoint{1.305cm}{1.468cm}}
\pgfpathcurveto{\pgfqpoint{1.28cm}{1.494cm}}{\pgfqpoint{1.245cm}{1.508cm}}{\pgfqpoint{1.209cm}{1.508cm}}
\pgfpathcurveto{\pgfqpoint{1.172cm}{1.508cm}}{\pgfqpoint{1.138cm}{1.494cm}}{\pgfqpoint{1.112cm}{1.468cm}}
\pgfpathcurveto{\pgfqpoint{1.087cm}{1.442cm}}{\pgfqpoint{1.072cm}{1.408cm}}{\pgfqpoint{1.072cm}{1.371cm}}
\pgfpathcurveto{\pgfqpoint{1.072cm}{1.335cm}}{\pgfqpoint{1.087cm}{1.3cm}}{\pgfqpoint{1.112cm}{1.274cm}}
\pgfpathcurveto{\pgfqpoint{1.138cm}{1.249cm}}{\pgfqpoint{1.172cm}{1.234cm}}{\pgfqpoint{1.209cm}{1.234cm}}
\pgfpathcurveto{\pgfqpoint{1.245cm}{1.234cm}}{\pgfqpoint{1.28cm}{1.249cm}}{\pgfqpoint{1.305cm}{1.274cm}}
\pgfpathcurveto{\pgfqpoint{1.331cm}{1.3cm}}{\pgfqpoint{1.345cm}{1.335cm}}{\pgfqpoint{1.345cm}{1.371cm}}
\pgfusepath{fill}
\begin{pgfscope}
\pgfsetdash{}{0cm}
\pgfsetlinewidth{0.818mm}
\pgfsetroundcap
\pgfsetmiterlimit{4.0}
\pgfpathmoveto{\pgfqpoint{0.682cm}{0.671cm}}
\pgfpathlineto{\pgfqpoint{0.682cm}{0.042cm}}
\pgfusepath{stroke}
\end{pgfscope}
\end{pgfscope}
\end{pgfscope}
\end{pgfscope}
\end{tikzpicture}}} + \phi + \psi)\\
&\quad= - \rmm{3X^{\!\resizebox{!}{.8em}{
\begin{tikzpicture}
\pgfpathmoveto{\pgfqpoint{0cm}{-0.035cm}}
\pgfpathlineto{\pgfqpoint{1.976cm}{-0.035cm}}
\pgfpathlineto{\pgfqpoint{1.976cm}{1.94cm}}
\pgfpathlineto{\pgfqpoint{0cm}{1.94cm}}
\pgfpathclose
\pgfusepath{clip}
\begin{pgfscope}
\begin{pgfscope}
\pgfpathmoveto{\pgfqpoint{0cm}{-0.035cm}}
\pgfpathlineto{\pgfqpoint{1.976cm}{-0.035cm}}
\pgfpathlineto{\pgfqpoint{1.976cm}{1.94cm}}
\pgfpathlineto{\pgfqpoint{0cm}{1.94cm}}
\pgfpathclose
\pgfusepath{clip}
\begin{pgfscope}
\begin{pgfscope}
\pgfsetdash{}{0cm}
\pgfsetlinewidth{0.818mm}
\pgfsetroundcap
\pgfsetroundjoin
\pgfsetmiterlimit{7.0}
\definecolor{eps2pgf_color}{gray}{0}\pgfsetstrokecolor{eps2pgf_color}\pgfsetfillcolor{eps2pgf_color}
\pgfpathmoveto{\pgfqpoint{0.117cm}{1.815cm}}
\pgfpathlineto{\pgfqpoint{0.682cm}{1.065cm}}
\pgfpathlineto{\pgfqpoint{1.246cm}{1.815cm}}
\pgfusepath{stroke}
\end{pgfscope}
\definecolor{eps2pgf_color}{gray}{0}\pgfsetstrokecolor{eps2pgf_color}\pgfsetfillcolor{eps2pgf_color}
\pgfpathmoveto{\pgfqpoint{0.273cm}{1.789cm}}
\pgfpathcurveto{\pgfqpoint{0.273cm}{1.825cm}}{\pgfqpoint{0.259cm}{1.86cm}}{\pgfqpoint{0.233cm}{1.886cm}}
\pgfpathcurveto{\pgfqpoint{0.207cm}{1.912cm}}{\pgfqpoint{0.173cm}{1.926cm}}{\pgfqpoint{0.137cm}{1.926cm}}
\pgfpathcurveto{\pgfqpoint{0.1cm}{1.926cm}}{\pgfqpoint{0.066cm}{1.912cm}}{\pgfqpoint{0.04cm}{1.886cm}}
\pgfpathcurveto{\pgfqpoint{0.014cm}{1.86cm}}{\pgfqpoint{0cm}{1.825cm}}{\pgfqpoint{0cm}{1.789cm}}
\pgfpathcurveto{\pgfqpoint{0cm}{1.753cm}}{\pgfqpoint{0.014cm}{1.718cm}}{\pgfqpoint{0.04cm}{1.692cm}}
\pgfpathcurveto{\pgfqpoint{0.066cm}{1.667cm}}{\pgfqpoint{0.1cm}{1.652cm}}{\pgfqpoint{0.137cm}{1.652cm}}
\pgfpathcurveto{\pgfqpoint{0.173cm}{1.652cm}}{\pgfqpoint{0.207cm}{1.667cm}}{\pgfqpoint{0.233cm}{1.692cm}}
\pgfpathcurveto{\pgfqpoint{0.259cm}{1.718cm}}{\pgfqpoint{0.273cm}{1.753cm}}{\pgfqpoint{0.273cm}{1.789cm}}
\pgfusepath{fill}
\begin{pgfscope}
\pgfsetdash{}{0cm}
\pgfsetlinewidth{0.818mm}
\pgfsetmiterlimit{7.0}
\pgfpathmoveto{\pgfqpoint{0.682cm}{1.065cm}}
\pgfpathlineto{\pgfqpoint{0.679cm}{1.812cm}}
\pgfusepath{stroke}
\end{pgfscope}
\pgfpathmoveto{\pgfqpoint{0.815cm}{1.793cm}}
\pgfpathcurveto{\pgfqpoint{0.815cm}{1.829cm}}{\pgfqpoint{0.801cm}{1.864cm}}{\pgfqpoint{0.775cm}{1.89cm}}
\pgfpathcurveto{\pgfqpoint{0.75cm}{1.915cm}}{\pgfqpoint{0.715cm}{1.93cm}}{\pgfqpoint{0.679cm}{1.93cm}}
\pgfpathcurveto{\pgfqpoint{0.643cm}{1.93cm}}{\pgfqpoint{0.608cm}{1.915cm}}{\pgfqpoint{0.582cm}{1.89cm}}
\pgfpathcurveto{\pgfqpoint{0.557cm}{1.864cm}}{\pgfqpoint{0.542cm}{1.829cm}}{\pgfqpoint{0.542cm}{1.793cm}}
\pgfpathcurveto{\pgfqpoint{0.542cm}{1.756cm}}{\pgfqpoint{0.557cm}{1.722cm}}{\pgfqpoint{0.582cm}{1.696cm}}
\pgfpathcurveto{\pgfqpoint{0.608cm}{1.67cm}}{\pgfqpoint{0.643cm}{1.656cm}}{\pgfqpoint{0.679cm}{1.656cm}}
\pgfpathcurveto{\pgfqpoint{0.715cm}{1.656cm}}{\pgfqpoint{0.75cm}{1.67cm}}{\pgfqpoint{0.775cm}{1.696cm}}
\pgfpathcurveto{\pgfqpoint{0.801cm}{1.722cm}}{\pgfqpoint{0.815cm}{1.756cm}}{\pgfqpoint{0.815cm}{1.793cm}}
\pgfusepath{fill}
\pgfpathmoveto{\pgfqpoint{1.345cm}{1.765cm}}
\pgfpathcurveto{\pgfqpoint{1.345cm}{1.801cm}}{\pgfqpoint{1.331cm}{1.836cm}}{\pgfqpoint{1.305cm}{1.862cm}}
\pgfpathcurveto{\pgfqpoint{1.28cm}{1.887cm}}{\pgfqpoint{1.245cm}{1.902cm}}{\pgfqpoint{1.209cm}{1.902cm}}
\pgfpathcurveto{\pgfqpoint{1.172cm}{1.902cm}}{\pgfqpoint{1.138cm}{1.887cm}}{\pgfqpoint{1.112cm}{1.862cm}}
\pgfpathcurveto{\pgfqpoint{1.087cm}{1.836cm}}{\pgfqpoint{1.072cm}{1.801cm}}{\pgfqpoint{1.072cm}{1.765cm}}
\pgfpathcurveto{\pgfqpoint{1.072cm}{1.728cm}}{\pgfqpoint{1.087cm}{1.694cm}}{\pgfqpoint{1.112cm}{1.668cm}}
\pgfpathcurveto{\pgfqpoint{1.138cm}{1.642cm}}{\pgfqpoint{1.172cm}{1.628cm}}{\pgfqpoint{1.209cm}{1.628cm}}
\pgfpathcurveto{\pgfqpoint{1.245cm}{1.628cm}}{\pgfqpoint{1.28cm}{1.642cm}}{\pgfqpoint{1.305cm}{1.668cm}}
\pgfpathcurveto{\pgfqpoint{1.331cm}{1.694cm}}{\pgfqpoint{1.345cm}{1.728cm}}{\pgfqpoint{1.345cm}{1.765cm}}
\pgfusepath{fill}
\begin{pgfscope}
\pgfsetdash{}{0cm}
\pgfsetlinewidth{0.818mm}
\pgfsetroundcap
\pgfsetroundjoin
\pgfsetmiterlimit{7.0}
\pgfpathmoveto{\pgfqpoint{0.682cm}{1.065cm}}
\pgfpathlineto{\pgfqpoint{1.246cm}{0.315cm}}
\pgfpathlineto{\pgfqpoint{1.811cm}{1.065cm}}
\pgfusepath{stroke}
\end{pgfscope}
\pgfpathmoveto{\pgfqpoint{1.948cm}{1.065cm}}
\pgfpathcurveto{\pgfqpoint{1.948cm}{1.101cm}}{\pgfqpoint{1.933cm}{1.136cm}}{\pgfqpoint{1.907cm}{1.162cm}}
\pgfpathcurveto{\pgfqpoint{1.882cm}{1.187cm}}{\pgfqpoint{1.847cm}{1.202cm}}{\pgfqpoint{1.811cm}{1.202cm}}
\pgfpathcurveto{\pgfqpoint{1.775cm}{1.202cm}}{\pgfqpoint{1.74cm}{1.187cm}}{\pgfqpoint{1.714cm}{1.162cm}}
\pgfpathcurveto{\pgfqpoint{1.689cm}{1.136cm}}{\pgfqpoint{1.674cm}{1.101cm}}{\pgfqpoint{1.674cm}{1.065cm}}
\pgfpathcurveto{\pgfqpoint{1.674cm}{1.029cm}}{\pgfqpoint{1.689cm}{0.994cm}}{\pgfqpoint{1.714cm}{0.968cm}}
\pgfpathcurveto{\pgfqpoint{1.74cm}{0.942cm}}{\pgfqpoint{1.775cm}{0.928cm}}{\pgfqpoint{1.811cm}{0.928cm}}
\pgfpathcurveto{\pgfqpoint{1.847cm}{0.928cm}}{\pgfqpoint{1.882cm}{0.942cm}}{\pgfqpoint{1.907cm}{0.968cm}}
\pgfpathcurveto{\pgfqpoint{1.933cm}{0.994cm}}{\pgfqpoint{1.948cm}{1.029cm}}{\pgfqpoint{1.948cm}{1.065cm}}
\pgfusepath{fill}
\begin{pgfscope}
\pgfsetdash{}{0cm}
\pgfsetlinewidth{0.818mm}
\pgfsetmiterlimit{7.0}
\pgfpathmoveto{\pgfqpoint{1.246cm}{0.315cm}}
\pgfpathlineto{\pgfqpoint{1.244cm}{1.061cm}}
\pgfusepath{stroke}
\end{pgfscope}
\pgfpathmoveto{\pgfqpoint{1.38cm}{1.065cm}}
\pgfpathcurveto{\pgfqpoint{1.38cm}{1.101cm}}{\pgfqpoint{1.366cm}{1.136cm}}{\pgfqpoint{1.34cm}{1.162cm}}
\pgfpathcurveto{\pgfqpoint{1.315cm}{1.187cm}}{\pgfqpoint{1.28cm}{1.202cm}}{\pgfqpoint{1.244cm}{1.202cm}}
\pgfpathcurveto{\pgfqpoint{1.207cm}{1.202cm}}{\pgfqpoint{1.173cm}{1.187cm}}{\pgfqpoint{1.147cm}{1.162cm}}
\pgfpathcurveto{\pgfqpoint{1.121cm}{1.136cm}}{\pgfqpoint{1.107cm}{1.101cm}}{\pgfqpoint{1.107cm}{1.065cm}}
\pgfpathcurveto{\pgfqpoint{1.107cm}{1.029cm}}{\pgfqpoint{1.121cm}{0.994cm}}{\pgfqpoint{1.147cm}{0.968cm}}
\pgfpathcurveto{\pgfqpoint{1.173cm}{0.942cm}}{\pgfqpoint{1.207cm}{0.928cm}}{\pgfqpoint{1.244cm}{0.928cm}}
\pgfpathcurveto{\pgfqpoint{1.28cm}{0.928cm}}{\pgfqpoint{1.315cm}{0.942cm}}{\pgfqpoint{1.34cm}{0.968cm}}
\pgfpathcurveto{\pgfqpoint{1.366cm}{0.994cm}}{\pgfqpoint{1.38cm}{1.029cm}}{\pgfqpoint{1.38cm}{1.065cm}}
\pgfusepath{fill}
\begin{pgfscope}
\pgfsetdash{}{0cm}
\pgfsetlinewidth{0.818mm}
\pgfsetmiterlimit{4.0}
\pgfpathmoveto{\pgfqpoint{1.383cm}{0.178cm}}
\pgfpathcurveto{\pgfqpoint{1.383cm}{0.214cm}}{\pgfqpoint{1.369cm}{0.249cm}}{\pgfqpoint{1.343cm}{0.275cm}}
\pgfpathcurveto{\pgfqpoint{1.317cm}{0.3cm}}{\pgfqpoint{1.283cm}{0.315cm}}{\pgfqpoint{1.246cm}{0.315cm}}
\pgfpathcurveto{\pgfqpoint{1.21cm}{0.315cm}}{\pgfqpoint{1.175cm}{0.3cm}}{\pgfqpoint{1.15cm}{0.275cm}}
\pgfpathcurveto{\pgfqpoint{1.124cm}{0.249cm}}{\pgfqpoint{1.11cm}{0.214cm}}{\pgfqpoint{1.11cm}{0.178cm}}
\pgfpathcurveto{\pgfqpoint{1.11cm}{0.141cm}}{\pgfqpoint{1.124cm}{0.107cm}}{\pgfqpoint{1.15cm}{0.081cm}}
\pgfpathcurveto{\pgfqpoint{1.175cm}{0.055cm}}{\pgfqpoint{1.21cm}{0.041cm}}{\pgfqpoint{1.246cm}{0.041cm}}
\pgfpathcurveto{\pgfqpoint{1.283cm}{0.041cm}}{\pgfqpoint{1.317cm}{0.055cm}}{\pgfqpoint{1.343cm}{0.081cm}}
\pgfpathcurveto{\pgfqpoint{1.369cm}{0.107cm}}{\pgfqpoint{1.383cm}{0.141cm}}{\pgfqpoint{1.383cm}{0.178cm}}
\pgfusepath{stroke}
\end{pgfscope}
\end{pgfscope}
\end{pgfscope}
\end{pgfscope}
\end{tikzpicture}}}}+\rmb{3\llbracket X^2 \rrbracket\circ\vartheta}-\rmg{3( - X^{\!\resizebox{0.6em}{!}{
\begin{tikzpicture}
\pgfpathmoveto{\pgfqpoint{0cm}{-0.035cm}}
\pgfpathlineto{\pgfqpoint{1.376cm}{-0.035cm}}
\pgfpathlineto{\pgfqpoint{1.376cm}{1.552cm}}
\pgfpathlineto{\pgfqpoint{0cm}{1.552cm}}
\pgfpathclose
\pgfusepath{clip}
\begin{pgfscope}
\begin{pgfscope}
\pgfpathmoveto{\pgfqpoint{0cm}{-0.035cm}}
\pgfpathlineto{\pgfqpoint{1.376cm}{-0.035cm}}
\pgfpathlineto{\pgfqpoint{1.376cm}{1.552cm}}
\pgfpathlineto{\pgfqpoint{0cm}{1.552cm}}
\pgfpathclose
\pgfusepath{clip}
\begin{pgfscope}
\begin{pgfscope}
\pgfsetdash{}{0cm}
\pgfsetlinewidth{0.818mm}
\pgfsetroundcap
\pgfsetroundjoin
\pgfsetmiterlimit{7.0}
\definecolor{eps2pgf_color}{gray}{0}\pgfsetstrokecolor{eps2pgf_color}\pgfsetfillcolor{eps2pgf_color}
\pgfpathmoveto{\pgfqpoint{0.117cm}{1.421cm}}
\pgfpathlineto{\pgfqpoint{0.682cm}{0.671cm}}
\pgfpathlineto{\pgfqpoint{1.246cm}{1.421cm}}
\pgfusepath{stroke}
\end{pgfscope}
\definecolor{eps2pgf_color}{gray}{0}\pgfsetstrokecolor{eps2pgf_color}\pgfsetfillcolor{eps2pgf_color}
\pgfpathmoveto{\pgfqpoint{0.273cm}{1.395cm}}
\pgfpathcurveto{\pgfqpoint{0.273cm}{1.432cm}}{\pgfqpoint{0.259cm}{1.467cm}}{\pgfqpoint{0.233cm}{1.492cm}}
\pgfpathcurveto{\pgfqpoint{0.207cm}{1.518cm}}{\pgfqpoint{0.173cm}{1.532cm}}{\pgfqpoint{0.137cm}{1.532cm}}
\pgfpathcurveto{\pgfqpoint{0.1cm}{1.532cm}}{\pgfqpoint{0.066cm}{1.518cm}}{\pgfqpoint{0.04cm}{1.492cm}}
\pgfpathcurveto{\pgfqpoint{0.014cm}{1.467cm}}{\pgfqpoint{0cm}{1.432cm}}{\pgfqpoint{0cm}{1.395cm}}
\pgfpathcurveto{\pgfqpoint{0cm}{1.359cm}}{\pgfqpoint{0.014cm}{1.324cm}}{\pgfqpoint{0.04cm}{1.299cm}}
\pgfpathcurveto{\pgfqpoint{0.066cm}{1.273cm}}{\pgfqpoint{0.1cm}{1.258cm}}{\pgfqpoint{0.137cm}{1.258cm}}
\pgfpathcurveto{\pgfqpoint{0.173cm}{1.258cm}}{\pgfqpoint{0.207cm}{1.273cm}}{\pgfqpoint{0.233cm}{1.299cm}}
\pgfpathcurveto{\pgfqpoint{0.259cm}{1.324cm}}{\pgfqpoint{0.273cm}{1.359cm}}{\pgfqpoint{0.273cm}{1.395cm}}
\pgfusepath{fill}
\begin{pgfscope}
\pgfsetdash{}{0cm}
\pgfsetlinewidth{0.818mm}
\pgfsetmiterlimit{7.0}
\pgfpathmoveto{\pgfqpoint{0.682cm}{0.671cm}}
\pgfpathlineto{\pgfqpoint{0.679cm}{1.418cm}}
\pgfusepath{stroke}
\end{pgfscope}
\pgfpathmoveto{\pgfqpoint{0.815cm}{1.399cm}}
\pgfpathcurveto{\pgfqpoint{0.815cm}{1.435cm}}{\pgfqpoint{0.801cm}{1.47cm}}{\pgfqpoint{0.775cm}{1.496cm}}
\pgfpathcurveto{\pgfqpoint{0.75cm}{1.521cm}}{\pgfqpoint{0.715cm}{1.536cm}}{\pgfqpoint{0.679cm}{1.536cm}}
\pgfpathcurveto{\pgfqpoint{0.643cm}{1.536cm}}{\pgfqpoint{0.608cm}{1.521cm}}{\pgfqpoint{0.582cm}{1.496cm}}
\pgfpathcurveto{\pgfqpoint{0.557cm}{1.47cm}}{\pgfqpoint{0.542cm}{1.435cm}}{\pgfqpoint{0.542cm}{1.399cm}}
\pgfpathcurveto{\pgfqpoint{0.542cm}{1.363cm}}{\pgfqpoint{0.557cm}{1.328cm}}{\pgfqpoint{0.582cm}{1.302cm}}
\pgfpathcurveto{\pgfqpoint{0.608cm}{1.276cm}}{\pgfqpoint{0.643cm}{1.262cm}}{\pgfqpoint{0.679cm}{1.262cm}}
\pgfpathcurveto{\pgfqpoint{0.715cm}{1.262cm}}{\pgfqpoint{0.75cm}{1.276cm}}{\pgfqpoint{0.775cm}{1.302cm}}
\pgfpathcurveto{\pgfqpoint{0.801cm}{1.328cm}}{\pgfqpoint{0.815cm}{1.363cm}}{\pgfqpoint{0.815cm}{1.399cm}}
\pgfusepath{fill}
\pgfpathmoveto{\pgfqpoint{1.345cm}{1.371cm}}
\pgfpathcurveto{\pgfqpoint{1.345cm}{1.408cm}}{\pgfqpoint{1.331cm}{1.442cm}}{\pgfqpoint{1.305cm}{1.468cm}}
\pgfpathcurveto{\pgfqpoint{1.28cm}{1.494cm}}{\pgfqpoint{1.245cm}{1.508cm}}{\pgfqpoint{1.209cm}{1.508cm}}
\pgfpathcurveto{\pgfqpoint{1.172cm}{1.508cm}}{\pgfqpoint{1.138cm}{1.494cm}}{\pgfqpoint{1.112cm}{1.468cm}}
\pgfpathcurveto{\pgfqpoint{1.087cm}{1.442cm}}{\pgfqpoint{1.072cm}{1.408cm}}{\pgfqpoint{1.072cm}{1.371cm}}
\pgfpathcurveto{\pgfqpoint{1.072cm}{1.335cm}}{\pgfqpoint{1.087cm}{1.3cm}}{\pgfqpoint{1.112cm}{1.274cm}}
\pgfpathcurveto{\pgfqpoint{1.138cm}{1.249cm}}{\pgfqpoint{1.172cm}{1.234cm}}{\pgfqpoint{1.209cm}{1.234cm}}
\pgfpathcurveto{\pgfqpoint{1.245cm}{1.234cm}}{\pgfqpoint{1.28cm}{1.249cm}}{\pgfqpoint{1.305cm}{1.274cm}}
\pgfpathcurveto{\pgfqpoint{1.331cm}{1.3cm}}{\pgfqpoint{1.345cm}{1.335cm}}{\pgfqpoint{1.345cm}{1.371cm}}
\pgfusepath{fill}
\begin{pgfscope}
\pgfsetdash{}{0cm}
\pgfsetlinewidth{0.818mm}
\pgfsetroundcap
\pgfsetmiterlimit{4.0}
\pgfpathmoveto{\pgfqpoint{0.682cm}{0.671cm}}
\pgfpathlineto{\pgfqpoint{0.682cm}{0.042cm}}
\pgfusepath{stroke}
\end{pgfscope}
\end{pgfscope}
\end{pgfscope}
\end{pgfscope}
\end{tikzpicture}}} + \phi + \psi)X^{\!\resizebox{!}{.8em}{
\begin{tikzpicture}
\pgfpathmoveto{\pgfqpoint{0cm}{-0.035cm}}
\pgfpathlineto{\pgfqpoint{1.976cm}{-0.035cm}}
\pgfpathlineto{\pgfqpoint{1.976cm}{1.94cm}}
\pgfpathlineto{\pgfqpoint{0cm}{1.94cm}}
\pgfpathclose
\pgfusepath{clip}
\begin{pgfscope}
\begin{pgfscope}
\pgfpathmoveto{\pgfqpoint{0cm}{-0.035cm}}
\pgfpathlineto{\pgfqpoint{1.976cm}{-0.035cm}}
\pgfpathlineto{\pgfqpoint{1.976cm}{1.94cm}}
\pgfpathlineto{\pgfqpoint{0cm}{1.94cm}}
\pgfpathclose
\pgfusepath{clip}
\begin{pgfscope}
\begin{pgfscope}
\pgfsetdash{}{0cm}
\pgfsetlinewidth{0.818mm}
\pgfsetroundcap
\pgfsetroundjoin
\pgfsetmiterlimit{7.0}
\definecolor{eps2pgf_color}{gray}{0}\pgfsetstrokecolor{eps2pgf_color}\pgfsetfillcolor{eps2pgf_color}
\pgfpathmoveto{\pgfqpoint{0.117cm}{1.815cm}}
\pgfpathlineto{\pgfqpoint{0.682cm}{1.065cm}}
\pgfpathlineto{\pgfqpoint{1.246cm}{1.815cm}}
\pgfusepath{stroke}
\end{pgfscope}
\definecolor{eps2pgf_color}{gray}{0}\pgfsetstrokecolor{eps2pgf_color}\pgfsetfillcolor{eps2pgf_color}
\pgfpathmoveto{\pgfqpoint{0.273cm}{1.789cm}}
\pgfpathcurveto{\pgfqpoint{0.273cm}{1.825cm}}{\pgfqpoint{0.259cm}{1.86cm}}{\pgfqpoint{0.233cm}{1.886cm}}
\pgfpathcurveto{\pgfqpoint{0.207cm}{1.912cm}}{\pgfqpoint{0.173cm}{1.926cm}}{\pgfqpoint{0.137cm}{1.926cm}}
\pgfpathcurveto{\pgfqpoint{0.1cm}{1.926cm}}{\pgfqpoint{0.066cm}{1.912cm}}{\pgfqpoint{0.04cm}{1.886cm}}
\pgfpathcurveto{\pgfqpoint{0.014cm}{1.86cm}}{\pgfqpoint{0cm}{1.825cm}}{\pgfqpoint{0cm}{1.789cm}}
\pgfpathcurveto{\pgfqpoint{0cm}{1.753cm}}{\pgfqpoint{0.014cm}{1.718cm}}{\pgfqpoint{0.04cm}{1.692cm}}
\pgfpathcurveto{\pgfqpoint{0.066cm}{1.667cm}}{\pgfqpoint{0.1cm}{1.652cm}}{\pgfqpoint{0.137cm}{1.652cm}}
\pgfpathcurveto{\pgfqpoint{0.173cm}{1.652cm}}{\pgfqpoint{0.207cm}{1.667cm}}{\pgfqpoint{0.233cm}{1.692cm}}
\pgfpathcurveto{\pgfqpoint{0.259cm}{1.718cm}}{\pgfqpoint{0.273cm}{1.753cm}}{\pgfqpoint{0.273cm}{1.789cm}}
\pgfusepath{fill}
\pgfpathmoveto{\pgfqpoint{1.345cm}{1.765cm}}
\pgfpathcurveto{\pgfqpoint{1.345cm}{1.801cm}}{\pgfqpoint{1.331cm}{1.836cm}}{\pgfqpoint{1.305cm}{1.862cm}}
\pgfpathcurveto{\pgfqpoint{1.28cm}{1.887cm}}{\pgfqpoint{1.245cm}{1.902cm}}{\pgfqpoint{1.209cm}{1.902cm}}
\pgfpathcurveto{\pgfqpoint{1.172cm}{1.902cm}}{\pgfqpoint{1.138cm}{1.887cm}}{\pgfqpoint{1.112cm}{1.862cm}}
\pgfpathcurveto{\pgfqpoint{1.087cm}{1.836cm}}{\pgfqpoint{1.072cm}{1.801cm}}{\pgfqpoint{1.072cm}{1.765cm}}
\pgfpathcurveto{\pgfqpoint{1.072cm}{1.728cm}}{\pgfqpoint{1.087cm}{1.694cm}}{\pgfqpoint{1.112cm}{1.668cm}}
\pgfpathcurveto{\pgfqpoint{1.138cm}{1.642cm}}{\pgfqpoint{1.172cm}{1.628cm}}{\pgfqpoint{1.209cm}{1.628cm}}
\pgfpathcurveto{\pgfqpoint{1.245cm}{1.628cm}}{\pgfqpoint{1.28cm}{1.642cm}}{\pgfqpoint{1.305cm}{1.668cm}}
\pgfpathcurveto{\pgfqpoint{1.331cm}{1.694cm}}{\pgfqpoint{1.345cm}{1.728cm}}{\pgfqpoint{1.345cm}{1.765cm}}
\pgfusepath{fill}
\begin{pgfscope}
\pgfsetdash{}{0cm}
\pgfsetlinewidth{0.818mm}
\pgfsetroundcap
\pgfsetroundjoin
\pgfsetmiterlimit{7.0}
\pgfpathmoveto{\pgfqpoint{0.682cm}{1.065cm}}
\pgfpathlineto{\pgfqpoint{1.246cm}{0.315cm}}
\pgfpathlineto{\pgfqpoint{1.811cm}{1.065cm}}
\pgfusepath{stroke}
\end{pgfscope}
\pgfpathmoveto{\pgfqpoint{1.948cm}{1.065cm}}
\pgfpathcurveto{\pgfqpoint{1.948cm}{1.101cm}}{\pgfqpoint{1.933cm}{1.136cm}}{\pgfqpoint{1.907cm}{1.162cm}}
\pgfpathcurveto{\pgfqpoint{1.882cm}{1.187cm}}{\pgfqpoint{1.847cm}{1.202cm}}{\pgfqpoint{1.811cm}{1.202cm}}
\pgfpathcurveto{\pgfqpoint{1.775cm}{1.202cm}}{\pgfqpoint{1.74cm}{1.187cm}}{\pgfqpoint{1.714cm}{1.162cm}}
\pgfpathcurveto{\pgfqpoint{1.689cm}{1.136cm}}{\pgfqpoint{1.674cm}{1.101cm}}{\pgfqpoint{1.674cm}{1.065cm}}
\pgfpathcurveto{\pgfqpoint{1.674cm}{1.029cm}}{\pgfqpoint{1.689cm}{0.994cm}}{\pgfqpoint{1.714cm}{0.968cm}}
\pgfpathcurveto{\pgfqpoint{1.74cm}{0.942cm}}{\pgfqpoint{1.775cm}{0.928cm}}{\pgfqpoint{1.811cm}{0.928cm}}
\pgfpathcurveto{\pgfqpoint{1.847cm}{0.928cm}}{\pgfqpoint{1.882cm}{0.942cm}}{\pgfqpoint{1.907cm}{0.968cm}}
\pgfpathcurveto{\pgfqpoint{1.933cm}{0.994cm}}{\pgfqpoint{1.948cm}{1.029cm}}{\pgfqpoint{1.948cm}{1.065cm}}
\pgfusepath{fill}
\begin{pgfscope}
\pgfsetdash{}{0cm}
\pgfsetlinewidth{0.818mm}
\pgfsetmiterlimit{7.0}
\pgfpathmoveto{\pgfqpoint{1.246cm}{0.315cm}}
\pgfpathlineto{\pgfqpoint{1.244cm}{1.061cm}}
\pgfusepath{stroke}
\end{pgfscope}
\pgfpathmoveto{\pgfqpoint{1.38cm}{1.065cm}}
\pgfpathcurveto{\pgfqpoint{1.38cm}{1.101cm}}{\pgfqpoint{1.366cm}{1.136cm}}{\pgfqpoint{1.34cm}{1.162cm}}
\pgfpathcurveto{\pgfqpoint{1.315cm}{1.187cm}}{\pgfqpoint{1.28cm}{1.202cm}}{\pgfqpoint{1.244cm}{1.202cm}}
\pgfpathcurveto{\pgfqpoint{1.207cm}{1.202cm}}{\pgfqpoint{1.173cm}{1.187cm}}{\pgfqpoint{1.147cm}{1.162cm}}
\pgfpathcurveto{\pgfqpoint{1.121cm}{1.136cm}}{\pgfqpoint{1.107cm}{1.101cm}}{\pgfqpoint{1.107cm}{1.065cm}}
\pgfpathcurveto{\pgfqpoint{1.107cm}{1.029cm}}{\pgfqpoint{1.121cm}{0.994cm}}{\pgfqpoint{1.147cm}{0.968cm}}
\pgfpathcurveto{\pgfqpoint{1.173cm}{0.942cm}}{\pgfqpoint{1.207cm}{0.928cm}}{\pgfqpoint{1.244cm}{0.928cm}}
\pgfpathcurveto{\pgfqpoint{1.28cm}{0.928cm}}{\pgfqpoint{1.315cm}{0.942cm}}{\pgfqpoint{1.34cm}{0.968cm}}
\pgfpathcurveto{\pgfqpoint{1.366cm}{0.994cm}}{\pgfqpoint{1.38cm}{1.029cm}}{\pgfqpoint{1.38cm}{1.065cm}}
\pgfusepath{fill}
\begin{pgfscope}
\pgfsetdash{}{0cm}
\pgfsetlinewidth{0.818mm}
\pgfsetmiterlimit{4.0}
\pgfpathmoveto{\pgfqpoint{1.383cm}{0.178cm}}
\pgfpathcurveto{\pgfqpoint{1.383cm}{0.214cm}}{\pgfqpoint{1.369cm}{0.249cm}}{\pgfqpoint{1.343cm}{0.275cm}}
\pgfpathcurveto{\pgfqpoint{1.317cm}{0.3cm}}{\pgfqpoint{1.283cm}{0.315cm}}{\pgfqpoint{1.246cm}{0.315cm}}
\pgfpathcurveto{\pgfqpoint{1.21cm}{0.315cm}}{\pgfqpoint{1.175cm}{0.3cm}}{\pgfqpoint{1.15cm}{0.275cm}}
\pgfpathcurveto{\pgfqpoint{1.124cm}{0.249cm}}{\pgfqpoint{1.11cm}{0.214cm}}{\pgfqpoint{1.11cm}{0.178cm}}
\pgfpathcurveto{\pgfqpoint{1.11cm}{0.141cm}}{\pgfqpoint{1.124cm}{0.107cm}}{\pgfqpoint{1.15cm}{0.081cm}}
\pgfpathcurveto{\pgfqpoint{1.175cm}{0.055cm}}{\pgfqpoint{1.21cm}{0.041cm}}{\pgfqpoint{1.246cm}{0.041cm}}
\pgfpathcurveto{\pgfqpoint{1.283cm}{0.041cm}}{\pgfqpoint{1.317cm}{0.055cm}}{\pgfqpoint{1.343cm}{0.081cm}}
\pgfpathcurveto{\pgfqpoint{1.369cm}{0.107cm}}{\pgfqpoint{1.383cm}{0.141cm}}{\pgfqpoint{1.383cm}{0.178cm}}
\pgfusepath{stroke}
\end{pgfscope}
\end{pgfscope}
\end{pgfscope}
\end{pgfscope}
\end{tikzpicture}}}}\\
&\qquad\qquad-\rmb{9\mathrm{com}(-X^{\!\resizebox{0.6em}{!}{
\begin{tikzpicture}
\pgfpathmoveto{\pgfqpoint{0cm}{-0.035cm}}
\pgfpathlineto{\pgfqpoint{1.376cm}{-0.035cm}}
\pgfpathlineto{\pgfqpoint{1.376cm}{1.552cm}}
\pgfpathlineto{\pgfqpoint{0cm}{1.552cm}}
\pgfpathclose
\pgfusepath{clip}
\begin{pgfscope}
\begin{pgfscope}
\pgfpathmoveto{\pgfqpoint{0cm}{-0.035cm}}
\pgfpathlineto{\pgfqpoint{1.376cm}{-0.035cm}}
\pgfpathlineto{\pgfqpoint{1.376cm}{1.552cm}}
\pgfpathlineto{\pgfqpoint{0cm}{1.552cm}}
\pgfpathclose
\pgfusepath{clip}
\begin{pgfscope}
\begin{pgfscope}
\pgfsetdash{}{0cm}
\pgfsetlinewidth{0.818mm}
\pgfsetroundcap
\pgfsetroundjoin
\pgfsetmiterlimit{7.0}
\definecolor{eps2pgf_color}{gray}{0}\pgfsetstrokecolor{eps2pgf_color}\pgfsetfillcolor{eps2pgf_color}
\pgfpathmoveto{\pgfqpoint{0.117cm}{1.421cm}}
\pgfpathlineto{\pgfqpoint{0.682cm}{0.671cm}}
\pgfpathlineto{\pgfqpoint{1.246cm}{1.421cm}}
\pgfusepath{stroke}
\end{pgfscope}
\definecolor{eps2pgf_color}{gray}{0}\pgfsetstrokecolor{eps2pgf_color}\pgfsetfillcolor{eps2pgf_color}
\pgfpathmoveto{\pgfqpoint{0.273cm}{1.395cm}}
\pgfpathcurveto{\pgfqpoint{0.273cm}{1.432cm}}{\pgfqpoint{0.259cm}{1.467cm}}{\pgfqpoint{0.233cm}{1.492cm}}
\pgfpathcurveto{\pgfqpoint{0.207cm}{1.518cm}}{\pgfqpoint{0.173cm}{1.532cm}}{\pgfqpoint{0.137cm}{1.532cm}}
\pgfpathcurveto{\pgfqpoint{0.1cm}{1.532cm}}{\pgfqpoint{0.066cm}{1.518cm}}{\pgfqpoint{0.04cm}{1.492cm}}
\pgfpathcurveto{\pgfqpoint{0.014cm}{1.467cm}}{\pgfqpoint{0cm}{1.432cm}}{\pgfqpoint{0cm}{1.395cm}}
\pgfpathcurveto{\pgfqpoint{0cm}{1.359cm}}{\pgfqpoint{0.014cm}{1.324cm}}{\pgfqpoint{0.04cm}{1.299cm}}
\pgfpathcurveto{\pgfqpoint{0.066cm}{1.273cm}}{\pgfqpoint{0.1cm}{1.258cm}}{\pgfqpoint{0.137cm}{1.258cm}}
\pgfpathcurveto{\pgfqpoint{0.173cm}{1.258cm}}{\pgfqpoint{0.207cm}{1.273cm}}{\pgfqpoint{0.233cm}{1.299cm}}
\pgfpathcurveto{\pgfqpoint{0.259cm}{1.324cm}}{\pgfqpoint{0.273cm}{1.359cm}}{\pgfqpoint{0.273cm}{1.395cm}}
\pgfusepath{fill}
\begin{pgfscope}
\pgfsetdash{}{0cm}
\pgfsetlinewidth{0.818mm}
\pgfsetmiterlimit{7.0}
\pgfpathmoveto{\pgfqpoint{0.682cm}{0.671cm}}
\pgfpathlineto{\pgfqpoint{0.679cm}{1.418cm}}
\pgfusepath{stroke}
\end{pgfscope}
\pgfpathmoveto{\pgfqpoint{0.815cm}{1.399cm}}
\pgfpathcurveto{\pgfqpoint{0.815cm}{1.435cm}}{\pgfqpoint{0.801cm}{1.47cm}}{\pgfqpoint{0.775cm}{1.496cm}}
\pgfpathcurveto{\pgfqpoint{0.75cm}{1.521cm}}{\pgfqpoint{0.715cm}{1.536cm}}{\pgfqpoint{0.679cm}{1.536cm}}
\pgfpathcurveto{\pgfqpoint{0.643cm}{1.536cm}}{\pgfqpoint{0.608cm}{1.521cm}}{\pgfqpoint{0.582cm}{1.496cm}}
\pgfpathcurveto{\pgfqpoint{0.557cm}{1.47cm}}{\pgfqpoint{0.542cm}{1.435cm}}{\pgfqpoint{0.542cm}{1.399cm}}
\pgfpathcurveto{\pgfqpoint{0.542cm}{1.363cm}}{\pgfqpoint{0.557cm}{1.328cm}}{\pgfqpoint{0.582cm}{1.302cm}}
\pgfpathcurveto{\pgfqpoint{0.608cm}{1.276cm}}{\pgfqpoint{0.643cm}{1.262cm}}{\pgfqpoint{0.679cm}{1.262cm}}
\pgfpathcurveto{\pgfqpoint{0.715cm}{1.262cm}}{\pgfqpoint{0.75cm}{1.276cm}}{\pgfqpoint{0.775cm}{1.302cm}}
\pgfpathcurveto{\pgfqpoint{0.801cm}{1.328cm}}{\pgfqpoint{0.815cm}{1.363cm}}{\pgfqpoint{0.815cm}{1.399cm}}
\pgfusepath{fill}
\pgfpathmoveto{\pgfqpoint{1.345cm}{1.371cm}}
\pgfpathcurveto{\pgfqpoint{1.345cm}{1.408cm}}{\pgfqpoint{1.331cm}{1.442cm}}{\pgfqpoint{1.305cm}{1.468cm}}
\pgfpathcurveto{\pgfqpoint{1.28cm}{1.494cm}}{\pgfqpoint{1.245cm}{1.508cm}}{\pgfqpoint{1.209cm}{1.508cm}}
\pgfpathcurveto{\pgfqpoint{1.172cm}{1.508cm}}{\pgfqpoint{1.138cm}{1.494cm}}{\pgfqpoint{1.112cm}{1.468cm}}
\pgfpathcurveto{\pgfqpoint{1.087cm}{1.442cm}}{\pgfqpoint{1.072cm}{1.408cm}}{\pgfqpoint{1.072cm}{1.371cm}}
\pgfpathcurveto{\pgfqpoint{1.072cm}{1.335cm}}{\pgfqpoint{1.087cm}{1.3cm}}{\pgfqpoint{1.112cm}{1.274cm}}
\pgfpathcurveto{\pgfqpoint{1.138cm}{1.249cm}}{\pgfqpoint{1.172cm}{1.234cm}}{\pgfqpoint{1.209cm}{1.234cm}}
\pgfpathcurveto{\pgfqpoint{1.245cm}{1.234cm}}{\pgfqpoint{1.28cm}{1.249cm}}{\pgfqpoint{1.305cm}{1.274cm}}
\pgfpathcurveto{\pgfqpoint{1.331cm}{1.3cm}}{\pgfqpoint{1.345cm}{1.335cm}}{\pgfqpoint{1.345cm}{1.371cm}}
\pgfusepath{fill}
\begin{pgfscope}
\pgfsetdash{}{0cm}
\pgfsetlinewidth{0.818mm}
\pgfsetroundcap
\pgfsetmiterlimit{4.0}
\pgfpathmoveto{\pgfqpoint{0.682cm}{0.671cm}}
\pgfpathlineto{\pgfqpoint{0.682cm}{0.042cm}}
\pgfusepath{stroke}
\end{pgfscope}
\end{pgfscope}
\end{pgfscope}
\end{pgfscope}
\end{tikzpicture}}} + \phi + \psi,X^{\!\resizebox{0.6em}{!}{
\begin{tikzpicture}
\pgfpathmoveto{\pgfqpoint{0cm}{0cm}}
\pgfpathlineto{\pgfqpoint{1.376cm}{0cm}}
\pgfpathlineto{\pgfqpoint{1.376cm}{1.588cm}}
\pgfpathlineto{\pgfqpoint{0cm}{1.588cm}}
\pgfpathclose
\pgfusepath{clip}
\begin{pgfscope}
\begin{pgfscope}
\pgfpathmoveto{\pgfqpoint{0cm}{0cm}}
\pgfpathlineto{\pgfqpoint{1.376cm}{0cm}}
\pgfpathlineto{\pgfqpoint{1.376cm}{1.588cm}}
\pgfpathlineto{\pgfqpoint{0cm}{1.588cm}}
\pgfpathclose
\pgfusepath{clip}
\begin{pgfscope}
\begin{pgfscope}
\definecolor{eps2pgf_color}{gray}{0.976471}\pgfsetstrokecolor{eps2pgf_color}\pgfsetfillcolor{eps2pgf_color}
\pgfpathmoveto{\pgfqpoint{0cm}{0cm}}
\pgfpathlineto{\pgfqpoint{1.376cm}{0cm}}
\pgfpathlineto{\pgfqpoint{1.376cm}{1.588cm}}
\pgfpathlineto{\pgfqpoint{0cm}{1.588cm}}
\pgfpathclose
\pgfusepath{fill}
\end{pgfscope}
\begin{pgfscope}
\pgfsetdash{}{0cm}
\pgfsetlinewidth{0.818mm}
\pgfsetroundcap
\pgfsetroundjoin
\pgfsetmiterlimit{7.0}
\definecolor{eps2pgf_color}{gray}{0}\pgfsetstrokecolor{eps2pgf_color}\pgfsetfillcolor{eps2pgf_color}
\pgfpathmoveto{\pgfqpoint{0.117cm}{1.476cm}}
\pgfpathlineto{\pgfqpoint{0.682cm}{0.726cm}}
\pgfpathlineto{\pgfqpoint{1.246cm}{1.476cm}}
\pgfusepath{stroke}
\end{pgfscope}
\definecolor{eps2pgf_color}{gray}{0}\pgfsetstrokecolor{eps2pgf_color}\pgfsetfillcolor{eps2pgf_color}
\pgfpathmoveto{\pgfqpoint{0.273cm}{1.451cm}}
\pgfpathcurveto{\pgfqpoint{0.273cm}{1.487cm}}{\pgfqpoint{0.259cm}{1.522cm}}{\pgfqpoint{0.233cm}{1.547cm}}
\pgfpathcurveto{\pgfqpoint{0.207cm}{1.573cm}}{\pgfqpoint{0.173cm}{1.588cm}}{\pgfqpoint{0.137cm}{1.588cm}}
\pgfpathcurveto{\pgfqpoint{0.1cm}{1.588cm}}{\pgfqpoint{0.066cm}{1.573cm}}{\pgfqpoint{0.04cm}{1.547cm}}
\pgfpathcurveto{\pgfqpoint{0.014cm}{1.522cm}}{\pgfqpoint{0cm}{1.487cm}}{\pgfqpoint{0cm}{1.451cm}}
\pgfpathcurveto{\pgfqpoint{0cm}{1.414cm}}{\pgfqpoint{0.014cm}{1.379cm}}{\pgfqpoint{0.04cm}{1.354cm}}
\pgfpathcurveto{\pgfqpoint{0.066cm}{1.328cm}}{\pgfqpoint{0.1cm}{1.314cm}}{\pgfqpoint{0.137cm}{1.314cm}}
\pgfpathcurveto{\pgfqpoint{0.173cm}{1.314cm}}{\pgfqpoint{0.207cm}{1.328cm}}{\pgfqpoint{0.233cm}{1.354cm}}
\pgfpathcurveto{\pgfqpoint{0.259cm}{1.379cm}}{\pgfqpoint{0.273cm}{1.414cm}}{\pgfqpoint{0.273cm}{1.451cm}}
\pgfusepath{fill}
\pgfpathmoveto{\pgfqpoint{1.345cm}{1.426cm}}
\pgfpathcurveto{\pgfqpoint{1.345cm}{1.463cm}}{\pgfqpoint{1.331cm}{1.497cm}}{\pgfqpoint{1.305cm}{1.523cm}}
\pgfpathcurveto{\pgfqpoint{1.28cm}{1.549cm}}{\pgfqpoint{1.245cm}{1.563cm}}{\pgfqpoint{1.209cm}{1.563cm}}
\pgfpathcurveto{\pgfqpoint{1.172cm}{1.563cm}}{\pgfqpoint{1.138cm}{1.549cm}}{\pgfqpoint{1.112cm}{1.523cm}}
\pgfpathcurveto{\pgfqpoint{1.087cm}{1.497cm}}{\pgfqpoint{1.072cm}{1.463cm}}{\pgfqpoint{1.072cm}{1.426cm}}
\pgfpathcurveto{\pgfqpoint{1.072cm}{1.39cm}}{\pgfqpoint{1.087cm}{1.355cm}}{\pgfqpoint{1.112cm}{1.329cm}}
\pgfpathcurveto{\pgfqpoint{1.138cm}{1.304cm}}{\pgfqpoint{1.172cm}{1.289cm}}{\pgfqpoint{1.209cm}{1.289cm}}
\pgfpathcurveto{\pgfqpoint{1.245cm}{1.289cm}}{\pgfqpoint{1.28cm}{1.304cm}}{\pgfqpoint{1.305cm}{1.329cm}}
\pgfpathcurveto{\pgfqpoint{1.331cm}{1.355cm}}{\pgfqpoint{1.345cm}{1.39cm}}{\pgfqpoint{1.345cm}{1.426cm}}
\pgfusepath{fill}
\begin{pgfscope}
\pgfsetdash{}{0cm}
\pgfsetlinewidth{0.818mm}
\pgfsetroundcap
\pgfsetmiterlimit{4.0}
\pgfpathmoveto{\pgfqpoint{0.682cm}{0.726cm}}
\pgfpathlineto{\pgfqpoint{0.682cm}{0.097cm}}
\pgfusepath{stroke}
\end{pgfscope}
\end{pgfscope}
\end{pgfscope}
\end{pgfscope}
\end{tikzpicture}}},\llbracket X^2 \rrbracket)}.
\end{align*}
Next, we have
\begin{align*}
&3X(-X^{\!\resizebox{0.6em}{!}{
\begin{tikzpicture}
\pgfpathmoveto{\pgfqpoint{0cm}{-0.035cm}}
\pgfpathlineto{\pgfqpoint{1.376cm}{-0.035cm}}
\pgfpathlineto{\pgfqpoint{1.376cm}{1.552cm}}
\pgfpathlineto{\pgfqpoint{0cm}{1.552cm}}
\pgfpathclose
\pgfusepath{clip}
\begin{pgfscope}
\begin{pgfscope}
\pgfpathmoveto{\pgfqpoint{0cm}{-0.035cm}}
\pgfpathlineto{\pgfqpoint{1.376cm}{-0.035cm}}
\pgfpathlineto{\pgfqpoint{1.376cm}{1.552cm}}
\pgfpathlineto{\pgfqpoint{0cm}{1.552cm}}
\pgfpathclose
\pgfusepath{clip}
\begin{pgfscope}
\begin{pgfscope}
\pgfsetdash{}{0cm}
\pgfsetlinewidth{0.818mm}
\pgfsetroundcap
\pgfsetroundjoin
\pgfsetmiterlimit{7.0}
\definecolor{eps2pgf_color}{gray}{0}\pgfsetstrokecolor{eps2pgf_color}\pgfsetfillcolor{eps2pgf_color}
\pgfpathmoveto{\pgfqpoint{0.117cm}{1.421cm}}
\pgfpathlineto{\pgfqpoint{0.682cm}{0.671cm}}
\pgfpathlineto{\pgfqpoint{1.246cm}{1.421cm}}
\pgfusepath{stroke}
\end{pgfscope}
\definecolor{eps2pgf_color}{gray}{0}\pgfsetstrokecolor{eps2pgf_color}\pgfsetfillcolor{eps2pgf_color}
\pgfpathmoveto{\pgfqpoint{0.273cm}{1.395cm}}
\pgfpathcurveto{\pgfqpoint{0.273cm}{1.432cm}}{\pgfqpoint{0.259cm}{1.467cm}}{\pgfqpoint{0.233cm}{1.492cm}}
\pgfpathcurveto{\pgfqpoint{0.207cm}{1.518cm}}{\pgfqpoint{0.173cm}{1.532cm}}{\pgfqpoint{0.137cm}{1.532cm}}
\pgfpathcurveto{\pgfqpoint{0.1cm}{1.532cm}}{\pgfqpoint{0.066cm}{1.518cm}}{\pgfqpoint{0.04cm}{1.492cm}}
\pgfpathcurveto{\pgfqpoint{0.014cm}{1.467cm}}{\pgfqpoint{0cm}{1.432cm}}{\pgfqpoint{0cm}{1.395cm}}
\pgfpathcurveto{\pgfqpoint{0cm}{1.359cm}}{\pgfqpoint{0.014cm}{1.324cm}}{\pgfqpoint{0.04cm}{1.299cm}}
\pgfpathcurveto{\pgfqpoint{0.066cm}{1.273cm}}{\pgfqpoint{0.1cm}{1.258cm}}{\pgfqpoint{0.137cm}{1.258cm}}
\pgfpathcurveto{\pgfqpoint{0.173cm}{1.258cm}}{\pgfqpoint{0.207cm}{1.273cm}}{\pgfqpoint{0.233cm}{1.299cm}}
\pgfpathcurveto{\pgfqpoint{0.259cm}{1.324cm}}{\pgfqpoint{0.273cm}{1.359cm}}{\pgfqpoint{0.273cm}{1.395cm}}
\pgfusepath{fill}
\begin{pgfscope}
\pgfsetdash{}{0cm}
\pgfsetlinewidth{0.818mm}
\pgfsetmiterlimit{7.0}
\pgfpathmoveto{\pgfqpoint{0.682cm}{0.671cm}}
\pgfpathlineto{\pgfqpoint{0.679cm}{1.418cm}}
\pgfusepath{stroke}
\end{pgfscope}
\pgfpathmoveto{\pgfqpoint{0.815cm}{1.399cm}}
\pgfpathcurveto{\pgfqpoint{0.815cm}{1.435cm}}{\pgfqpoint{0.801cm}{1.47cm}}{\pgfqpoint{0.775cm}{1.496cm}}
\pgfpathcurveto{\pgfqpoint{0.75cm}{1.521cm}}{\pgfqpoint{0.715cm}{1.536cm}}{\pgfqpoint{0.679cm}{1.536cm}}
\pgfpathcurveto{\pgfqpoint{0.643cm}{1.536cm}}{\pgfqpoint{0.608cm}{1.521cm}}{\pgfqpoint{0.582cm}{1.496cm}}
\pgfpathcurveto{\pgfqpoint{0.557cm}{1.47cm}}{\pgfqpoint{0.542cm}{1.435cm}}{\pgfqpoint{0.542cm}{1.399cm}}
\pgfpathcurveto{\pgfqpoint{0.542cm}{1.363cm}}{\pgfqpoint{0.557cm}{1.328cm}}{\pgfqpoint{0.582cm}{1.302cm}}
\pgfpathcurveto{\pgfqpoint{0.608cm}{1.276cm}}{\pgfqpoint{0.643cm}{1.262cm}}{\pgfqpoint{0.679cm}{1.262cm}}
\pgfpathcurveto{\pgfqpoint{0.715cm}{1.262cm}}{\pgfqpoint{0.75cm}{1.276cm}}{\pgfqpoint{0.775cm}{1.302cm}}
\pgfpathcurveto{\pgfqpoint{0.801cm}{1.328cm}}{\pgfqpoint{0.815cm}{1.363cm}}{\pgfqpoint{0.815cm}{1.399cm}}
\pgfusepath{fill}
\pgfpathmoveto{\pgfqpoint{1.345cm}{1.371cm}}
\pgfpathcurveto{\pgfqpoint{1.345cm}{1.408cm}}{\pgfqpoint{1.331cm}{1.442cm}}{\pgfqpoint{1.305cm}{1.468cm}}
\pgfpathcurveto{\pgfqpoint{1.28cm}{1.494cm}}{\pgfqpoint{1.245cm}{1.508cm}}{\pgfqpoint{1.209cm}{1.508cm}}
\pgfpathcurveto{\pgfqpoint{1.172cm}{1.508cm}}{\pgfqpoint{1.138cm}{1.494cm}}{\pgfqpoint{1.112cm}{1.468cm}}
\pgfpathcurveto{\pgfqpoint{1.087cm}{1.442cm}}{\pgfqpoint{1.072cm}{1.408cm}}{\pgfqpoint{1.072cm}{1.371cm}}
\pgfpathcurveto{\pgfqpoint{1.072cm}{1.335cm}}{\pgfqpoint{1.087cm}{1.3cm}}{\pgfqpoint{1.112cm}{1.274cm}}
\pgfpathcurveto{\pgfqpoint{1.138cm}{1.249cm}}{\pgfqpoint{1.172cm}{1.234cm}}{\pgfqpoint{1.209cm}{1.234cm}}
\pgfpathcurveto{\pgfqpoint{1.245cm}{1.234cm}}{\pgfqpoint{1.28cm}{1.249cm}}{\pgfqpoint{1.305cm}{1.274cm}}
\pgfpathcurveto{\pgfqpoint{1.331cm}{1.3cm}}{\pgfqpoint{1.345cm}{1.335cm}}{\pgfqpoint{1.345cm}{1.371cm}}
\pgfusepath{fill}
\begin{pgfscope}
\pgfsetdash{}{0cm}
\pgfsetlinewidth{0.818mm}
\pgfsetroundcap
\pgfsetmiterlimit{4.0}
\pgfpathmoveto{\pgfqpoint{0.682cm}{0.671cm}}
\pgfpathlineto{\pgfqpoint{0.682cm}{0.042cm}}
\pgfusepath{stroke}
\end{pgfscope}
\end{pgfscope}
\end{pgfscope}
\end{pgfscope}
\end{tikzpicture}}}+\phi+\psi)^2=3X(X^{\!\resizebox{0.6em}{!}{
\begin{tikzpicture}
\pgfpathmoveto{\pgfqpoint{0cm}{-0.035cm}}
\pgfpathlineto{\pgfqpoint{1.376cm}{-0.035cm}}
\pgfpathlineto{\pgfqpoint{1.376cm}{1.552cm}}
\pgfpathlineto{\pgfqpoint{0cm}{1.552cm}}
\pgfpathclose
\pgfusepath{clip}
\begin{pgfscope}
\begin{pgfscope}
\pgfpathmoveto{\pgfqpoint{0cm}{-0.035cm}}
\pgfpathlineto{\pgfqpoint{1.376cm}{-0.035cm}}
\pgfpathlineto{\pgfqpoint{1.376cm}{1.552cm}}
\pgfpathlineto{\pgfqpoint{0cm}{1.552cm}}
\pgfpathclose
\pgfusepath{clip}
\begin{pgfscope}
\begin{pgfscope}
\pgfsetdash{}{0cm}
\pgfsetlinewidth{0.818mm}
\pgfsetroundcap
\pgfsetroundjoin
\pgfsetmiterlimit{7.0}
\definecolor{eps2pgf_color}{gray}{0}\pgfsetstrokecolor{eps2pgf_color}\pgfsetfillcolor{eps2pgf_color}
\pgfpathmoveto{\pgfqpoint{0.117cm}{1.421cm}}
\pgfpathlineto{\pgfqpoint{0.682cm}{0.671cm}}
\pgfpathlineto{\pgfqpoint{1.246cm}{1.421cm}}
\pgfusepath{stroke}
\end{pgfscope}
\definecolor{eps2pgf_color}{gray}{0}\pgfsetstrokecolor{eps2pgf_color}\pgfsetfillcolor{eps2pgf_color}
\pgfpathmoveto{\pgfqpoint{0.273cm}{1.395cm}}
\pgfpathcurveto{\pgfqpoint{0.273cm}{1.432cm}}{\pgfqpoint{0.259cm}{1.467cm}}{\pgfqpoint{0.233cm}{1.492cm}}
\pgfpathcurveto{\pgfqpoint{0.207cm}{1.518cm}}{\pgfqpoint{0.173cm}{1.532cm}}{\pgfqpoint{0.137cm}{1.532cm}}
\pgfpathcurveto{\pgfqpoint{0.1cm}{1.532cm}}{\pgfqpoint{0.066cm}{1.518cm}}{\pgfqpoint{0.04cm}{1.492cm}}
\pgfpathcurveto{\pgfqpoint{0.014cm}{1.467cm}}{\pgfqpoint{0cm}{1.432cm}}{\pgfqpoint{0cm}{1.395cm}}
\pgfpathcurveto{\pgfqpoint{0cm}{1.359cm}}{\pgfqpoint{0.014cm}{1.324cm}}{\pgfqpoint{0.04cm}{1.299cm}}
\pgfpathcurveto{\pgfqpoint{0.066cm}{1.273cm}}{\pgfqpoint{0.1cm}{1.258cm}}{\pgfqpoint{0.137cm}{1.258cm}}
\pgfpathcurveto{\pgfqpoint{0.173cm}{1.258cm}}{\pgfqpoint{0.207cm}{1.273cm}}{\pgfqpoint{0.233cm}{1.299cm}}
\pgfpathcurveto{\pgfqpoint{0.259cm}{1.324cm}}{\pgfqpoint{0.273cm}{1.359cm}}{\pgfqpoint{0.273cm}{1.395cm}}
\pgfusepath{fill}
\begin{pgfscope}
\pgfsetdash{}{0cm}
\pgfsetlinewidth{0.818mm}
\pgfsetmiterlimit{7.0}
\pgfpathmoveto{\pgfqpoint{0.682cm}{0.671cm}}
\pgfpathlineto{\pgfqpoint{0.679cm}{1.418cm}}
\pgfusepath{stroke}
\end{pgfscope}
\pgfpathmoveto{\pgfqpoint{0.815cm}{1.399cm}}
\pgfpathcurveto{\pgfqpoint{0.815cm}{1.435cm}}{\pgfqpoint{0.801cm}{1.47cm}}{\pgfqpoint{0.775cm}{1.496cm}}
\pgfpathcurveto{\pgfqpoint{0.75cm}{1.521cm}}{\pgfqpoint{0.715cm}{1.536cm}}{\pgfqpoint{0.679cm}{1.536cm}}
\pgfpathcurveto{\pgfqpoint{0.643cm}{1.536cm}}{\pgfqpoint{0.608cm}{1.521cm}}{\pgfqpoint{0.582cm}{1.496cm}}
\pgfpathcurveto{\pgfqpoint{0.557cm}{1.47cm}}{\pgfqpoint{0.542cm}{1.435cm}}{\pgfqpoint{0.542cm}{1.399cm}}
\pgfpathcurveto{\pgfqpoint{0.542cm}{1.363cm}}{\pgfqpoint{0.557cm}{1.328cm}}{\pgfqpoint{0.582cm}{1.302cm}}
\pgfpathcurveto{\pgfqpoint{0.608cm}{1.276cm}}{\pgfqpoint{0.643cm}{1.262cm}}{\pgfqpoint{0.679cm}{1.262cm}}
\pgfpathcurveto{\pgfqpoint{0.715cm}{1.262cm}}{\pgfqpoint{0.75cm}{1.276cm}}{\pgfqpoint{0.775cm}{1.302cm}}
\pgfpathcurveto{\pgfqpoint{0.801cm}{1.328cm}}{\pgfqpoint{0.815cm}{1.363cm}}{\pgfqpoint{0.815cm}{1.399cm}}
\pgfusepath{fill}
\pgfpathmoveto{\pgfqpoint{1.345cm}{1.371cm}}
\pgfpathcurveto{\pgfqpoint{1.345cm}{1.408cm}}{\pgfqpoint{1.331cm}{1.442cm}}{\pgfqpoint{1.305cm}{1.468cm}}
\pgfpathcurveto{\pgfqpoint{1.28cm}{1.494cm}}{\pgfqpoint{1.245cm}{1.508cm}}{\pgfqpoint{1.209cm}{1.508cm}}
\pgfpathcurveto{\pgfqpoint{1.172cm}{1.508cm}}{\pgfqpoint{1.138cm}{1.494cm}}{\pgfqpoint{1.112cm}{1.468cm}}
\pgfpathcurveto{\pgfqpoint{1.087cm}{1.442cm}}{\pgfqpoint{1.072cm}{1.408cm}}{\pgfqpoint{1.072cm}{1.371cm}}
\pgfpathcurveto{\pgfqpoint{1.072cm}{1.335cm}}{\pgfqpoint{1.087cm}{1.3cm}}{\pgfqpoint{1.112cm}{1.274cm}}
\pgfpathcurveto{\pgfqpoint{1.138cm}{1.249cm}}{\pgfqpoint{1.172cm}{1.234cm}}{\pgfqpoint{1.209cm}{1.234cm}}
\pgfpathcurveto{\pgfqpoint{1.245cm}{1.234cm}}{\pgfqpoint{1.28cm}{1.249cm}}{\pgfqpoint{1.305cm}{1.274cm}}
\pgfpathcurveto{\pgfqpoint{1.331cm}{1.3cm}}{\pgfqpoint{1.345cm}{1.335cm}}{\pgfqpoint{1.345cm}{1.371cm}}
\pgfusepath{fill}
\begin{pgfscope}
\pgfsetdash{}{0cm}
\pgfsetlinewidth{0.818mm}
\pgfsetroundcap
\pgfsetmiterlimit{4.0}
\pgfpathmoveto{\pgfqpoint{0.682cm}{0.671cm}}
\pgfpathlineto{\pgfqpoint{0.682cm}{0.042cm}}
\pgfusepath{stroke}
\end{pgfscope}
\end{pgfscope}
\end{pgfscope}
\end{pgfscope}
\end{tikzpicture}}})^2-6XX^{\!\resizebox{0.6em}{!}{
\begin{tikzpicture}
\pgfpathmoveto{\pgfqpoint{0cm}{-0.035cm}}
\pgfpathlineto{\pgfqpoint{1.376cm}{-0.035cm}}
\pgfpathlineto{\pgfqpoint{1.376cm}{1.552cm}}
\pgfpathlineto{\pgfqpoint{0cm}{1.552cm}}
\pgfpathclose
\pgfusepath{clip}
\begin{pgfscope}
\begin{pgfscope}
\pgfpathmoveto{\pgfqpoint{0cm}{-0.035cm}}
\pgfpathlineto{\pgfqpoint{1.376cm}{-0.035cm}}
\pgfpathlineto{\pgfqpoint{1.376cm}{1.552cm}}
\pgfpathlineto{\pgfqpoint{0cm}{1.552cm}}
\pgfpathclose
\pgfusepath{clip}
\begin{pgfscope}
\begin{pgfscope}
\pgfsetdash{}{0cm}
\pgfsetlinewidth{0.818mm}
\pgfsetroundcap
\pgfsetroundjoin
\pgfsetmiterlimit{7.0}
\definecolor{eps2pgf_color}{gray}{0}\pgfsetstrokecolor{eps2pgf_color}\pgfsetfillcolor{eps2pgf_color}
\pgfpathmoveto{\pgfqpoint{0.117cm}{1.421cm}}
\pgfpathlineto{\pgfqpoint{0.682cm}{0.671cm}}
\pgfpathlineto{\pgfqpoint{1.246cm}{1.421cm}}
\pgfusepath{stroke}
\end{pgfscope}
\definecolor{eps2pgf_color}{gray}{0}\pgfsetstrokecolor{eps2pgf_color}\pgfsetfillcolor{eps2pgf_color}
\pgfpathmoveto{\pgfqpoint{0.273cm}{1.395cm}}
\pgfpathcurveto{\pgfqpoint{0.273cm}{1.432cm}}{\pgfqpoint{0.259cm}{1.467cm}}{\pgfqpoint{0.233cm}{1.492cm}}
\pgfpathcurveto{\pgfqpoint{0.207cm}{1.518cm}}{\pgfqpoint{0.173cm}{1.532cm}}{\pgfqpoint{0.137cm}{1.532cm}}
\pgfpathcurveto{\pgfqpoint{0.1cm}{1.532cm}}{\pgfqpoint{0.066cm}{1.518cm}}{\pgfqpoint{0.04cm}{1.492cm}}
\pgfpathcurveto{\pgfqpoint{0.014cm}{1.467cm}}{\pgfqpoint{0cm}{1.432cm}}{\pgfqpoint{0cm}{1.395cm}}
\pgfpathcurveto{\pgfqpoint{0cm}{1.359cm}}{\pgfqpoint{0.014cm}{1.324cm}}{\pgfqpoint{0.04cm}{1.299cm}}
\pgfpathcurveto{\pgfqpoint{0.066cm}{1.273cm}}{\pgfqpoint{0.1cm}{1.258cm}}{\pgfqpoint{0.137cm}{1.258cm}}
\pgfpathcurveto{\pgfqpoint{0.173cm}{1.258cm}}{\pgfqpoint{0.207cm}{1.273cm}}{\pgfqpoint{0.233cm}{1.299cm}}
\pgfpathcurveto{\pgfqpoint{0.259cm}{1.324cm}}{\pgfqpoint{0.273cm}{1.359cm}}{\pgfqpoint{0.273cm}{1.395cm}}
\pgfusepath{fill}
\begin{pgfscope}
\pgfsetdash{}{0cm}
\pgfsetlinewidth{0.818mm}
\pgfsetmiterlimit{7.0}
\pgfpathmoveto{\pgfqpoint{0.682cm}{0.671cm}}
\pgfpathlineto{\pgfqpoint{0.679cm}{1.418cm}}
\pgfusepath{stroke}
\end{pgfscope}
\pgfpathmoveto{\pgfqpoint{0.815cm}{1.399cm}}
\pgfpathcurveto{\pgfqpoint{0.815cm}{1.435cm}}{\pgfqpoint{0.801cm}{1.47cm}}{\pgfqpoint{0.775cm}{1.496cm}}
\pgfpathcurveto{\pgfqpoint{0.75cm}{1.521cm}}{\pgfqpoint{0.715cm}{1.536cm}}{\pgfqpoint{0.679cm}{1.536cm}}
\pgfpathcurveto{\pgfqpoint{0.643cm}{1.536cm}}{\pgfqpoint{0.608cm}{1.521cm}}{\pgfqpoint{0.582cm}{1.496cm}}
\pgfpathcurveto{\pgfqpoint{0.557cm}{1.47cm}}{\pgfqpoint{0.542cm}{1.435cm}}{\pgfqpoint{0.542cm}{1.399cm}}
\pgfpathcurveto{\pgfqpoint{0.542cm}{1.363cm}}{\pgfqpoint{0.557cm}{1.328cm}}{\pgfqpoint{0.582cm}{1.302cm}}
\pgfpathcurveto{\pgfqpoint{0.608cm}{1.276cm}}{\pgfqpoint{0.643cm}{1.262cm}}{\pgfqpoint{0.679cm}{1.262cm}}
\pgfpathcurveto{\pgfqpoint{0.715cm}{1.262cm}}{\pgfqpoint{0.75cm}{1.276cm}}{\pgfqpoint{0.775cm}{1.302cm}}
\pgfpathcurveto{\pgfqpoint{0.801cm}{1.328cm}}{\pgfqpoint{0.815cm}{1.363cm}}{\pgfqpoint{0.815cm}{1.399cm}}
\pgfusepath{fill}
\pgfpathmoveto{\pgfqpoint{1.345cm}{1.371cm}}
\pgfpathcurveto{\pgfqpoint{1.345cm}{1.408cm}}{\pgfqpoint{1.331cm}{1.442cm}}{\pgfqpoint{1.305cm}{1.468cm}}
\pgfpathcurveto{\pgfqpoint{1.28cm}{1.494cm}}{\pgfqpoint{1.245cm}{1.508cm}}{\pgfqpoint{1.209cm}{1.508cm}}
\pgfpathcurveto{\pgfqpoint{1.172cm}{1.508cm}}{\pgfqpoint{1.138cm}{1.494cm}}{\pgfqpoint{1.112cm}{1.468cm}}
\pgfpathcurveto{\pgfqpoint{1.087cm}{1.442cm}}{\pgfqpoint{1.072cm}{1.408cm}}{\pgfqpoint{1.072cm}{1.371cm}}
\pgfpathcurveto{\pgfqpoint{1.072cm}{1.335cm}}{\pgfqpoint{1.087cm}{1.3cm}}{\pgfqpoint{1.112cm}{1.274cm}}
\pgfpathcurveto{\pgfqpoint{1.138cm}{1.249cm}}{\pgfqpoint{1.172cm}{1.234cm}}{\pgfqpoint{1.209cm}{1.234cm}}
\pgfpathcurveto{\pgfqpoint{1.245cm}{1.234cm}}{\pgfqpoint{1.28cm}{1.249cm}}{\pgfqpoint{1.305cm}{1.274cm}}
\pgfpathcurveto{\pgfqpoint{1.331cm}{1.3cm}}{\pgfqpoint{1.345cm}{1.335cm}}{\pgfqpoint{1.345cm}{1.371cm}}
\pgfusepath{fill}
\begin{pgfscope}
\pgfsetdash{}{0cm}
\pgfsetlinewidth{0.818mm}
\pgfsetroundcap
\pgfsetmiterlimit{4.0}
\pgfpathmoveto{\pgfqpoint{0.682cm}{0.671cm}}
\pgfpathlineto{\pgfqpoint{0.682cm}{0.042cm}}
\pgfusepath{stroke}
\end{pgfscope}
\end{pgfscope}
\end{pgfscope}
\end{pgfscope}
\end{tikzpicture}}}(\phi+\psi)+3X(\phi+\psi)^2\\
&=3X\succ(X^{\!\resizebox{0.6em}{!}{
\begin{tikzpicture}
\pgfpathmoveto{\pgfqpoint{0cm}{-0.035cm}}
\pgfpathlineto{\pgfqpoint{1.376cm}{-0.035cm}}
\pgfpathlineto{\pgfqpoint{1.376cm}{1.552cm}}
\pgfpathlineto{\pgfqpoint{0cm}{1.552cm}}
\pgfpathclose
\pgfusepath{clip}
\begin{pgfscope}
\begin{pgfscope}
\pgfpathmoveto{\pgfqpoint{0cm}{-0.035cm}}
\pgfpathlineto{\pgfqpoint{1.376cm}{-0.035cm}}
\pgfpathlineto{\pgfqpoint{1.376cm}{1.552cm}}
\pgfpathlineto{\pgfqpoint{0cm}{1.552cm}}
\pgfpathclose
\pgfusepath{clip}
\begin{pgfscope}
\begin{pgfscope}
\pgfsetdash{}{0cm}
\pgfsetlinewidth{0.818mm}
\pgfsetroundcap
\pgfsetroundjoin
\pgfsetmiterlimit{7.0}
\definecolor{eps2pgf_color}{gray}{0}\pgfsetstrokecolor{eps2pgf_color}\pgfsetfillcolor{eps2pgf_color}
\pgfpathmoveto{\pgfqpoint{0.117cm}{1.421cm}}
\pgfpathlineto{\pgfqpoint{0.682cm}{0.671cm}}
\pgfpathlineto{\pgfqpoint{1.246cm}{1.421cm}}
\pgfusepath{stroke}
\end{pgfscope}
\definecolor{eps2pgf_color}{gray}{0}\pgfsetstrokecolor{eps2pgf_color}\pgfsetfillcolor{eps2pgf_color}
\pgfpathmoveto{\pgfqpoint{0.273cm}{1.395cm}}
\pgfpathcurveto{\pgfqpoint{0.273cm}{1.432cm}}{\pgfqpoint{0.259cm}{1.467cm}}{\pgfqpoint{0.233cm}{1.492cm}}
\pgfpathcurveto{\pgfqpoint{0.207cm}{1.518cm}}{\pgfqpoint{0.173cm}{1.532cm}}{\pgfqpoint{0.137cm}{1.532cm}}
\pgfpathcurveto{\pgfqpoint{0.1cm}{1.532cm}}{\pgfqpoint{0.066cm}{1.518cm}}{\pgfqpoint{0.04cm}{1.492cm}}
\pgfpathcurveto{\pgfqpoint{0.014cm}{1.467cm}}{\pgfqpoint{0cm}{1.432cm}}{\pgfqpoint{0cm}{1.395cm}}
\pgfpathcurveto{\pgfqpoint{0cm}{1.359cm}}{\pgfqpoint{0.014cm}{1.324cm}}{\pgfqpoint{0.04cm}{1.299cm}}
\pgfpathcurveto{\pgfqpoint{0.066cm}{1.273cm}}{\pgfqpoint{0.1cm}{1.258cm}}{\pgfqpoint{0.137cm}{1.258cm}}
\pgfpathcurveto{\pgfqpoint{0.173cm}{1.258cm}}{\pgfqpoint{0.207cm}{1.273cm}}{\pgfqpoint{0.233cm}{1.299cm}}
\pgfpathcurveto{\pgfqpoint{0.259cm}{1.324cm}}{\pgfqpoint{0.273cm}{1.359cm}}{\pgfqpoint{0.273cm}{1.395cm}}
\pgfusepath{fill}
\begin{pgfscope}
\pgfsetdash{}{0cm}
\pgfsetlinewidth{0.818mm}
\pgfsetmiterlimit{7.0}
\pgfpathmoveto{\pgfqpoint{0.682cm}{0.671cm}}
\pgfpathlineto{\pgfqpoint{0.679cm}{1.418cm}}
\pgfusepath{stroke}
\end{pgfscope}
\pgfpathmoveto{\pgfqpoint{0.815cm}{1.399cm}}
\pgfpathcurveto{\pgfqpoint{0.815cm}{1.435cm}}{\pgfqpoint{0.801cm}{1.47cm}}{\pgfqpoint{0.775cm}{1.496cm}}
\pgfpathcurveto{\pgfqpoint{0.75cm}{1.521cm}}{\pgfqpoint{0.715cm}{1.536cm}}{\pgfqpoint{0.679cm}{1.536cm}}
\pgfpathcurveto{\pgfqpoint{0.643cm}{1.536cm}}{\pgfqpoint{0.608cm}{1.521cm}}{\pgfqpoint{0.582cm}{1.496cm}}
\pgfpathcurveto{\pgfqpoint{0.557cm}{1.47cm}}{\pgfqpoint{0.542cm}{1.435cm}}{\pgfqpoint{0.542cm}{1.399cm}}
\pgfpathcurveto{\pgfqpoint{0.542cm}{1.363cm}}{\pgfqpoint{0.557cm}{1.328cm}}{\pgfqpoint{0.582cm}{1.302cm}}
\pgfpathcurveto{\pgfqpoint{0.608cm}{1.276cm}}{\pgfqpoint{0.643cm}{1.262cm}}{\pgfqpoint{0.679cm}{1.262cm}}
\pgfpathcurveto{\pgfqpoint{0.715cm}{1.262cm}}{\pgfqpoint{0.75cm}{1.276cm}}{\pgfqpoint{0.775cm}{1.302cm}}
\pgfpathcurveto{\pgfqpoint{0.801cm}{1.328cm}}{\pgfqpoint{0.815cm}{1.363cm}}{\pgfqpoint{0.815cm}{1.399cm}}
\pgfusepath{fill}
\pgfpathmoveto{\pgfqpoint{1.345cm}{1.371cm}}
\pgfpathcurveto{\pgfqpoint{1.345cm}{1.408cm}}{\pgfqpoint{1.331cm}{1.442cm}}{\pgfqpoint{1.305cm}{1.468cm}}
\pgfpathcurveto{\pgfqpoint{1.28cm}{1.494cm}}{\pgfqpoint{1.245cm}{1.508cm}}{\pgfqpoint{1.209cm}{1.508cm}}
\pgfpathcurveto{\pgfqpoint{1.172cm}{1.508cm}}{\pgfqpoint{1.138cm}{1.494cm}}{\pgfqpoint{1.112cm}{1.468cm}}
\pgfpathcurveto{\pgfqpoint{1.087cm}{1.442cm}}{\pgfqpoint{1.072cm}{1.408cm}}{\pgfqpoint{1.072cm}{1.371cm}}
\pgfpathcurveto{\pgfqpoint{1.072cm}{1.335cm}}{\pgfqpoint{1.087cm}{1.3cm}}{\pgfqpoint{1.112cm}{1.274cm}}
\pgfpathcurveto{\pgfqpoint{1.138cm}{1.249cm}}{\pgfqpoint{1.172cm}{1.234cm}}{\pgfqpoint{1.209cm}{1.234cm}}
\pgfpathcurveto{\pgfqpoint{1.245cm}{1.234cm}}{\pgfqpoint{1.28cm}{1.249cm}}{\pgfqpoint{1.305cm}{1.274cm}}
\pgfpathcurveto{\pgfqpoint{1.331cm}{1.3cm}}{\pgfqpoint{1.345cm}{1.335cm}}{\pgfqpoint{1.345cm}{1.371cm}}
\pgfusepath{fill}
\begin{pgfscope}
\pgfsetdash{}{0cm}
\pgfsetlinewidth{0.818mm}
\pgfsetroundcap
\pgfsetmiterlimit{4.0}
\pgfpathmoveto{\pgfqpoint{0.682cm}{0.671cm}}
\pgfpathlineto{\pgfqpoint{0.682cm}{0.042cm}}
\pgfusepath{stroke}
\end{pgfscope}
\end{pgfscope}
\end{pgfscope}
\end{pgfscope}
\end{tikzpicture}}})^2+3X\prec(X^{\!\resizebox{0.6em}{!}{
\begin{tikzpicture}
\pgfpathmoveto{\pgfqpoint{0cm}{-0.035cm}}
\pgfpathlineto{\pgfqpoint{1.376cm}{-0.035cm}}
\pgfpathlineto{\pgfqpoint{1.376cm}{1.552cm}}
\pgfpathlineto{\pgfqpoint{0cm}{1.552cm}}
\pgfpathclose
\pgfusepath{clip}
\begin{pgfscope}
\begin{pgfscope}
\pgfpathmoveto{\pgfqpoint{0cm}{-0.035cm}}
\pgfpathlineto{\pgfqpoint{1.376cm}{-0.035cm}}
\pgfpathlineto{\pgfqpoint{1.376cm}{1.552cm}}
\pgfpathlineto{\pgfqpoint{0cm}{1.552cm}}
\pgfpathclose
\pgfusepath{clip}
\begin{pgfscope}
\begin{pgfscope}
\pgfsetdash{}{0cm}
\pgfsetlinewidth{0.818mm}
\pgfsetroundcap
\pgfsetroundjoin
\pgfsetmiterlimit{7.0}
\definecolor{eps2pgf_color}{gray}{0}\pgfsetstrokecolor{eps2pgf_color}\pgfsetfillcolor{eps2pgf_color}
\pgfpathmoveto{\pgfqpoint{0.117cm}{1.421cm}}
\pgfpathlineto{\pgfqpoint{0.682cm}{0.671cm}}
\pgfpathlineto{\pgfqpoint{1.246cm}{1.421cm}}
\pgfusepath{stroke}
\end{pgfscope}
\definecolor{eps2pgf_color}{gray}{0}\pgfsetstrokecolor{eps2pgf_color}\pgfsetfillcolor{eps2pgf_color}
\pgfpathmoveto{\pgfqpoint{0.273cm}{1.395cm}}
\pgfpathcurveto{\pgfqpoint{0.273cm}{1.432cm}}{\pgfqpoint{0.259cm}{1.467cm}}{\pgfqpoint{0.233cm}{1.492cm}}
\pgfpathcurveto{\pgfqpoint{0.207cm}{1.518cm}}{\pgfqpoint{0.173cm}{1.532cm}}{\pgfqpoint{0.137cm}{1.532cm}}
\pgfpathcurveto{\pgfqpoint{0.1cm}{1.532cm}}{\pgfqpoint{0.066cm}{1.518cm}}{\pgfqpoint{0.04cm}{1.492cm}}
\pgfpathcurveto{\pgfqpoint{0.014cm}{1.467cm}}{\pgfqpoint{0cm}{1.432cm}}{\pgfqpoint{0cm}{1.395cm}}
\pgfpathcurveto{\pgfqpoint{0cm}{1.359cm}}{\pgfqpoint{0.014cm}{1.324cm}}{\pgfqpoint{0.04cm}{1.299cm}}
\pgfpathcurveto{\pgfqpoint{0.066cm}{1.273cm}}{\pgfqpoint{0.1cm}{1.258cm}}{\pgfqpoint{0.137cm}{1.258cm}}
\pgfpathcurveto{\pgfqpoint{0.173cm}{1.258cm}}{\pgfqpoint{0.207cm}{1.273cm}}{\pgfqpoint{0.233cm}{1.299cm}}
\pgfpathcurveto{\pgfqpoint{0.259cm}{1.324cm}}{\pgfqpoint{0.273cm}{1.359cm}}{\pgfqpoint{0.273cm}{1.395cm}}
\pgfusepath{fill}
\begin{pgfscope}
\pgfsetdash{}{0cm}
\pgfsetlinewidth{0.818mm}
\pgfsetmiterlimit{7.0}
\pgfpathmoveto{\pgfqpoint{0.682cm}{0.671cm}}
\pgfpathlineto{\pgfqpoint{0.679cm}{1.418cm}}
\pgfusepath{stroke}
\end{pgfscope}
\pgfpathmoveto{\pgfqpoint{0.815cm}{1.399cm}}
\pgfpathcurveto{\pgfqpoint{0.815cm}{1.435cm}}{\pgfqpoint{0.801cm}{1.47cm}}{\pgfqpoint{0.775cm}{1.496cm}}
\pgfpathcurveto{\pgfqpoint{0.75cm}{1.521cm}}{\pgfqpoint{0.715cm}{1.536cm}}{\pgfqpoint{0.679cm}{1.536cm}}
\pgfpathcurveto{\pgfqpoint{0.643cm}{1.536cm}}{\pgfqpoint{0.608cm}{1.521cm}}{\pgfqpoint{0.582cm}{1.496cm}}
\pgfpathcurveto{\pgfqpoint{0.557cm}{1.47cm}}{\pgfqpoint{0.542cm}{1.435cm}}{\pgfqpoint{0.542cm}{1.399cm}}
\pgfpathcurveto{\pgfqpoint{0.542cm}{1.363cm}}{\pgfqpoint{0.557cm}{1.328cm}}{\pgfqpoint{0.582cm}{1.302cm}}
\pgfpathcurveto{\pgfqpoint{0.608cm}{1.276cm}}{\pgfqpoint{0.643cm}{1.262cm}}{\pgfqpoint{0.679cm}{1.262cm}}
\pgfpathcurveto{\pgfqpoint{0.715cm}{1.262cm}}{\pgfqpoint{0.75cm}{1.276cm}}{\pgfqpoint{0.775cm}{1.302cm}}
\pgfpathcurveto{\pgfqpoint{0.801cm}{1.328cm}}{\pgfqpoint{0.815cm}{1.363cm}}{\pgfqpoint{0.815cm}{1.399cm}}
\pgfusepath{fill}
\pgfpathmoveto{\pgfqpoint{1.345cm}{1.371cm}}
\pgfpathcurveto{\pgfqpoint{1.345cm}{1.408cm}}{\pgfqpoint{1.331cm}{1.442cm}}{\pgfqpoint{1.305cm}{1.468cm}}
\pgfpathcurveto{\pgfqpoint{1.28cm}{1.494cm}}{\pgfqpoint{1.245cm}{1.508cm}}{\pgfqpoint{1.209cm}{1.508cm}}
\pgfpathcurveto{\pgfqpoint{1.172cm}{1.508cm}}{\pgfqpoint{1.138cm}{1.494cm}}{\pgfqpoint{1.112cm}{1.468cm}}
\pgfpathcurveto{\pgfqpoint{1.087cm}{1.442cm}}{\pgfqpoint{1.072cm}{1.408cm}}{\pgfqpoint{1.072cm}{1.371cm}}
\pgfpathcurveto{\pgfqpoint{1.072cm}{1.335cm}}{\pgfqpoint{1.087cm}{1.3cm}}{\pgfqpoint{1.112cm}{1.274cm}}
\pgfpathcurveto{\pgfqpoint{1.138cm}{1.249cm}}{\pgfqpoint{1.172cm}{1.234cm}}{\pgfqpoint{1.209cm}{1.234cm}}
\pgfpathcurveto{\pgfqpoint{1.245cm}{1.234cm}}{\pgfqpoint{1.28cm}{1.249cm}}{\pgfqpoint{1.305cm}{1.274cm}}
\pgfpathcurveto{\pgfqpoint{1.331cm}{1.3cm}}{\pgfqpoint{1.345cm}{1.335cm}}{\pgfqpoint{1.345cm}{1.371cm}}
\pgfusepath{fill}
\begin{pgfscope}
\pgfsetdash{}{0cm}
\pgfsetlinewidth{0.818mm}
\pgfsetroundcap
\pgfsetmiterlimit{4.0}
\pgfpathmoveto{\pgfqpoint{0.682cm}{0.671cm}}
\pgfpathlineto{\pgfqpoint{0.682cm}{0.042cm}}
\pgfusepath{stroke}
\end{pgfscope}
\end{pgfscope}
\end{pgfscope}
\end{pgfscope}
\end{tikzpicture}}})^2+3X\circ(X^{\!\resizebox{0.6em}{!}{
\begin{tikzpicture}
\pgfpathmoveto{\pgfqpoint{0cm}{-0.035cm}}
\pgfpathlineto{\pgfqpoint{1.376cm}{-0.035cm}}
\pgfpathlineto{\pgfqpoint{1.376cm}{1.552cm}}
\pgfpathlineto{\pgfqpoint{0cm}{1.552cm}}
\pgfpathclose
\pgfusepath{clip}
\begin{pgfscope}
\begin{pgfscope}
\pgfpathmoveto{\pgfqpoint{0cm}{-0.035cm}}
\pgfpathlineto{\pgfqpoint{1.376cm}{-0.035cm}}
\pgfpathlineto{\pgfqpoint{1.376cm}{1.552cm}}
\pgfpathlineto{\pgfqpoint{0cm}{1.552cm}}
\pgfpathclose
\pgfusepath{clip}
\begin{pgfscope}
\begin{pgfscope}
\pgfsetdash{}{0cm}
\pgfsetlinewidth{0.818mm}
\pgfsetroundcap
\pgfsetroundjoin
\pgfsetmiterlimit{7.0}
\definecolor{eps2pgf_color}{gray}{0}\pgfsetstrokecolor{eps2pgf_color}\pgfsetfillcolor{eps2pgf_color}
\pgfpathmoveto{\pgfqpoint{0.117cm}{1.421cm}}
\pgfpathlineto{\pgfqpoint{0.682cm}{0.671cm}}
\pgfpathlineto{\pgfqpoint{1.246cm}{1.421cm}}
\pgfusepath{stroke}
\end{pgfscope}
\definecolor{eps2pgf_color}{gray}{0}\pgfsetstrokecolor{eps2pgf_color}\pgfsetfillcolor{eps2pgf_color}
\pgfpathmoveto{\pgfqpoint{0.273cm}{1.395cm}}
\pgfpathcurveto{\pgfqpoint{0.273cm}{1.432cm}}{\pgfqpoint{0.259cm}{1.467cm}}{\pgfqpoint{0.233cm}{1.492cm}}
\pgfpathcurveto{\pgfqpoint{0.207cm}{1.518cm}}{\pgfqpoint{0.173cm}{1.532cm}}{\pgfqpoint{0.137cm}{1.532cm}}
\pgfpathcurveto{\pgfqpoint{0.1cm}{1.532cm}}{\pgfqpoint{0.066cm}{1.518cm}}{\pgfqpoint{0.04cm}{1.492cm}}
\pgfpathcurveto{\pgfqpoint{0.014cm}{1.467cm}}{\pgfqpoint{0cm}{1.432cm}}{\pgfqpoint{0cm}{1.395cm}}
\pgfpathcurveto{\pgfqpoint{0cm}{1.359cm}}{\pgfqpoint{0.014cm}{1.324cm}}{\pgfqpoint{0.04cm}{1.299cm}}
\pgfpathcurveto{\pgfqpoint{0.066cm}{1.273cm}}{\pgfqpoint{0.1cm}{1.258cm}}{\pgfqpoint{0.137cm}{1.258cm}}
\pgfpathcurveto{\pgfqpoint{0.173cm}{1.258cm}}{\pgfqpoint{0.207cm}{1.273cm}}{\pgfqpoint{0.233cm}{1.299cm}}
\pgfpathcurveto{\pgfqpoint{0.259cm}{1.324cm}}{\pgfqpoint{0.273cm}{1.359cm}}{\pgfqpoint{0.273cm}{1.395cm}}
\pgfusepath{fill}
\begin{pgfscope}
\pgfsetdash{}{0cm}
\pgfsetlinewidth{0.818mm}
\pgfsetmiterlimit{7.0}
\pgfpathmoveto{\pgfqpoint{0.682cm}{0.671cm}}
\pgfpathlineto{\pgfqpoint{0.679cm}{1.418cm}}
\pgfusepath{stroke}
\end{pgfscope}
\pgfpathmoveto{\pgfqpoint{0.815cm}{1.399cm}}
\pgfpathcurveto{\pgfqpoint{0.815cm}{1.435cm}}{\pgfqpoint{0.801cm}{1.47cm}}{\pgfqpoint{0.775cm}{1.496cm}}
\pgfpathcurveto{\pgfqpoint{0.75cm}{1.521cm}}{\pgfqpoint{0.715cm}{1.536cm}}{\pgfqpoint{0.679cm}{1.536cm}}
\pgfpathcurveto{\pgfqpoint{0.643cm}{1.536cm}}{\pgfqpoint{0.608cm}{1.521cm}}{\pgfqpoint{0.582cm}{1.496cm}}
\pgfpathcurveto{\pgfqpoint{0.557cm}{1.47cm}}{\pgfqpoint{0.542cm}{1.435cm}}{\pgfqpoint{0.542cm}{1.399cm}}
\pgfpathcurveto{\pgfqpoint{0.542cm}{1.363cm}}{\pgfqpoint{0.557cm}{1.328cm}}{\pgfqpoint{0.582cm}{1.302cm}}
\pgfpathcurveto{\pgfqpoint{0.608cm}{1.276cm}}{\pgfqpoint{0.643cm}{1.262cm}}{\pgfqpoint{0.679cm}{1.262cm}}
\pgfpathcurveto{\pgfqpoint{0.715cm}{1.262cm}}{\pgfqpoint{0.75cm}{1.276cm}}{\pgfqpoint{0.775cm}{1.302cm}}
\pgfpathcurveto{\pgfqpoint{0.801cm}{1.328cm}}{\pgfqpoint{0.815cm}{1.363cm}}{\pgfqpoint{0.815cm}{1.399cm}}
\pgfusepath{fill}
\pgfpathmoveto{\pgfqpoint{1.345cm}{1.371cm}}
\pgfpathcurveto{\pgfqpoint{1.345cm}{1.408cm}}{\pgfqpoint{1.331cm}{1.442cm}}{\pgfqpoint{1.305cm}{1.468cm}}
\pgfpathcurveto{\pgfqpoint{1.28cm}{1.494cm}}{\pgfqpoint{1.245cm}{1.508cm}}{\pgfqpoint{1.209cm}{1.508cm}}
\pgfpathcurveto{\pgfqpoint{1.172cm}{1.508cm}}{\pgfqpoint{1.138cm}{1.494cm}}{\pgfqpoint{1.112cm}{1.468cm}}
\pgfpathcurveto{\pgfqpoint{1.087cm}{1.442cm}}{\pgfqpoint{1.072cm}{1.408cm}}{\pgfqpoint{1.072cm}{1.371cm}}
\pgfpathcurveto{\pgfqpoint{1.072cm}{1.335cm}}{\pgfqpoint{1.087cm}{1.3cm}}{\pgfqpoint{1.112cm}{1.274cm}}
\pgfpathcurveto{\pgfqpoint{1.138cm}{1.249cm}}{\pgfqpoint{1.172cm}{1.234cm}}{\pgfqpoint{1.209cm}{1.234cm}}
\pgfpathcurveto{\pgfqpoint{1.245cm}{1.234cm}}{\pgfqpoint{1.28cm}{1.249cm}}{\pgfqpoint{1.305cm}{1.274cm}}
\pgfpathcurveto{\pgfqpoint{1.331cm}{1.3cm}}{\pgfqpoint{1.345cm}{1.335cm}}{\pgfqpoint{1.345cm}{1.371cm}}
\pgfusepath{fill}
\begin{pgfscope}
\pgfsetdash{}{0cm}
\pgfsetlinewidth{0.818mm}
\pgfsetroundcap
\pgfsetmiterlimit{4.0}
\pgfpathmoveto{\pgfqpoint{0.682cm}{0.671cm}}
\pgfpathlineto{\pgfqpoint{0.682cm}{0.042cm}}
\pgfusepath{stroke}
\end{pgfscope}
\end{pgfscope}
\end{pgfscope}
\end{pgfscope}
\end{tikzpicture}}})^2\\
&\quad-6X\succ(X^{\!\resizebox{0.6em}{!}{
\begin{tikzpicture}
\pgfpathmoveto{\pgfqpoint{0cm}{-0.035cm}}
\pgfpathlineto{\pgfqpoint{1.376cm}{-0.035cm}}
\pgfpathlineto{\pgfqpoint{1.376cm}{1.552cm}}
\pgfpathlineto{\pgfqpoint{0cm}{1.552cm}}
\pgfpathclose
\pgfusepath{clip}
\begin{pgfscope}
\begin{pgfscope}
\pgfpathmoveto{\pgfqpoint{0cm}{-0.035cm}}
\pgfpathlineto{\pgfqpoint{1.376cm}{-0.035cm}}
\pgfpathlineto{\pgfqpoint{1.376cm}{1.552cm}}
\pgfpathlineto{\pgfqpoint{0cm}{1.552cm}}
\pgfpathclose
\pgfusepath{clip}
\begin{pgfscope}
\begin{pgfscope}
\pgfsetdash{}{0cm}
\pgfsetlinewidth{0.818mm}
\pgfsetroundcap
\pgfsetroundjoin
\pgfsetmiterlimit{7.0}
\definecolor{eps2pgf_color}{gray}{0}\pgfsetstrokecolor{eps2pgf_color}\pgfsetfillcolor{eps2pgf_color}
\pgfpathmoveto{\pgfqpoint{0.117cm}{1.421cm}}
\pgfpathlineto{\pgfqpoint{0.682cm}{0.671cm}}
\pgfpathlineto{\pgfqpoint{1.246cm}{1.421cm}}
\pgfusepath{stroke}
\end{pgfscope}
\definecolor{eps2pgf_color}{gray}{0}\pgfsetstrokecolor{eps2pgf_color}\pgfsetfillcolor{eps2pgf_color}
\pgfpathmoveto{\pgfqpoint{0.273cm}{1.395cm}}
\pgfpathcurveto{\pgfqpoint{0.273cm}{1.432cm}}{\pgfqpoint{0.259cm}{1.467cm}}{\pgfqpoint{0.233cm}{1.492cm}}
\pgfpathcurveto{\pgfqpoint{0.207cm}{1.518cm}}{\pgfqpoint{0.173cm}{1.532cm}}{\pgfqpoint{0.137cm}{1.532cm}}
\pgfpathcurveto{\pgfqpoint{0.1cm}{1.532cm}}{\pgfqpoint{0.066cm}{1.518cm}}{\pgfqpoint{0.04cm}{1.492cm}}
\pgfpathcurveto{\pgfqpoint{0.014cm}{1.467cm}}{\pgfqpoint{0cm}{1.432cm}}{\pgfqpoint{0cm}{1.395cm}}
\pgfpathcurveto{\pgfqpoint{0cm}{1.359cm}}{\pgfqpoint{0.014cm}{1.324cm}}{\pgfqpoint{0.04cm}{1.299cm}}
\pgfpathcurveto{\pgfqpoint{0.066cm}{1.273cm}}{\pgfqpoint{0.1cm}{1.258cm}}{\pgfqpoint{0.137cm}{1.258cm}}
\pgfpathcurveto{\pgfqpoint{0.173cm}{1.258cm}}{\pgfqpoint{0.207cm}{1.273cm}}{\pgfqpoint{0.233cm}{1.299cm}}
\pgfpathcurveto{\pgfqpoint{0.259cm}{1.324cm}}{\pgfqpoint{0.273cm}{1.359cm}}{\pgfqpoint{0.273cm}{1.395cm}}
\pgfusepath{fill}
\begin{pgfscope}
\pgfsetdash{}{0cm}
\pgfsetlinewidth{0.818mm}
\pgfsetmiterlimit{7.0}
\pgfpathmoveto{\pgfqpoint{0.682cm}{0.671cm}}
\pgfpathlineto{\pgfqpoint{0.679cm}{1.418cm}}
\pgfusepath{stroke}
\end{pgfscope}
\pgfpathmoveto{\pgfqpoint{0.815cm}{1.399cm}}
\pgfpathcurveto{\pgfqpoint{0.815cm}{1.435cm}}{\pgfqpoint{0.801cm}{1.47cm}}{\pgfqpoint{0.775cm}{1.496cm}}
\pgfpathcurveto{\pgfqpoint{0.75cm}{1.521cm}}{\pgfqpoint{0.715cm}{1.536cm}}{\pgfqpoint{0.679cm}{1.536cm}}
\pgfpathcurveto{\pgfqpoint{0.643cm}{1.536cm}}{\pgfqpoint{0.608cm}{1.521cm}}{\pgfqpoint{0.582cm}{1.496cm}}
\pgfpathcurveto{\pgfqpoint{0.557cm}{1.47cm}}{\pgfqpoint{0.542cm}{1.435cm}}{\pgfqpoint{0.542cm}{1.399cm}}
\pgfpathcurveto{\pgfqpoint{0.542cm}{1.363cm}}{\pgfqpoint{0.557cm}{1.328cm}}{\pgfqpoint{0.582cm}{1.302cm}}
\pgfpathcurveto{\pgfqpoint{0.608cm}{1.276cm}}{\pgfqpoint{0.643cm}{1.262cm}}{\pgfqpoint{0.679cm}{1.262cm}}
\pgfpathcurveto{\pgfqpoint{0.715cm}{1.262cm}}{\pgfqpoint{0.75cm}{1.276cm}}{\pgfqpoint{0.775cm}{1.302cm}}
\pgfpathcurveto{\pgfqpoint{0.801cm}{1.328cm}}{\pgfqpoint{0.815cm}{1.363cm}}{\pgfqpoint{0.815cm}{1.399cm}}
\pgfusepath{fill}
\pgfpathmoveto{\pgfqpoint{1.345cm}{1.371cm}}
\pgfpathcurveto{\pgfqpoint{1.345cm}{1.408cm}}{\pgfqpoint{1.331cm}{1.442cm}}{\pgfqpoint{1.305cm}{1.468cm}}
\pgfpathcurveto{\pgfqpoint{1.28cm}{1.494cm}}{\pgfqpoint{1.245cm}{1.508cm}}{\pgfqpoint{1.209cm}{1.508cm}}
\pgfpathcurveto{\pgfqpoint{1.172cm}{1.508cm}}{\pgfqpoint{1.138cm}{1.494cm}}{\pgfqpoint{1.112cm}{1.468cm}}
\pgfpathcurveto{\pgfqpoint{1.087cm}{1.442cm}}{\pgfqpoint{1.072cm}{1.408cm}}{\pgfqpoint{1.072cm}{1.371cm}}
\pgfpathcurveto{\pgfqpoint{1.072cm}{1.335cm}}{\pgfqpoint{1.087cm}{1.3cm}}{\pgfqpoint{1.112cm}{1.274cm}}
\pgfpathcurveto{\pgfqpoint{1.138cm}{1.249cm}}{\pgfqpoint{1.172cm}{1.234cm}}{\pgfqpoint{1.209cm}{1.234cm}}
\pgfpathcurveto{\pgfqpoint{1.245cm}{1.234cm}}{\pgfqpoint{1.28cm}{1.249cm}}{\pgfqpoint{1.305cm}{1.274cm}}
\pgfpathcurveto{\pgfqpoint{1.331cm}{1.3cm}}{\pgfqpoint{1.345cm}{1.335cm}}{\pgfqpoint{1.345cm}{1.371cm}}
\pgfusepath{fill}
\begin{pgfscope}
\pgfsetdash{}{0cm}
\pgfsetlinewidth{0.818mm}
\pgfsetroundcap
\pgfsetmiterlimit{4.0}
\pgfpathmoveto{\pgfqpoint{0.682cm}{0.671cm}}
\pgfpathlineto{\pgfqpoint{0.682cm}{0.042cm}}
\pgfusepath{stroke}
\end{pgfscope}
\end{pgfscope}
\end{pgfscope}
\end{pgfscope}
\end{tikzpicture}}}(\phi+\psi))-6X\prec(X^{\!\resizebox{0.6em}{!}{
\begin{tikzpicture}
\pgfpathmoveto{\pgfqpoint{0cm}{-0.035cm}}
\pgfpathlineto{\pgfqpoint{1.376cm}{-0.035cm}}
\pgfpathlineto{\pgfqpoint{1.376cm}{1.552cm}}
\pgfpathlineto{\pgfqpoint{0cm}{1.552cm}}
\pgfpathclose
\pgfusepath{clip}
\begin{pgfscope}
\begin{pgfscope}
\pgfpathmoveto{\pgfqpoint{0cm}{-0.035cm}}
\pgfpathlineto{\pgfqpoint{1.376cm}{-0.035cm}}
\pgfpathlineto{\pgfqpoint{1.376cm}{1.552cm}}
\pgfpathlineto{\pgfqpoint{0cm}{1.552cm}}
\pgfpathclose
\pgfusepath{clip}
\begin{pgfscope}
\begin{pgfscope}
\pgfsetdash{}{0cm}
\pgfsetlinewidth{0.818mm}
\pgfsetroundcap
\pgfsetroundjoin
\pgfsetmiterlimit{7.0}
\definecolor{eps2pgf_color}{gray}{0}\pgfsetstrokecolor{eps2pgf_color}\pgfsetfillcolor{eps2pgf_color}
\pgfpathmoveto{\pgfqpoint{0.117cm}{1.421cm}}
\pgfpathlineto{\pgfqpoint{0.682cm}{0.671cm}}
\pgfpathlineto{\pgfqpoint{1.246cm}{1.421cm}}
\pgfusepath{stroke}
\end{pgfscope}
\definecolor{eps2pgf_color}{gray}{0}\pgfsetstrokecolor{eps2pgf_color}\pgfsetfillcolor{eps2pgf_color}
\pgfpathmoveto{\pgfqpoint{0.273cm}{1.395cm}}
\pgfpathcurveto{\pgfqpoint{0.273cm}{1.432cm}}{\pgfqpoint{0.259cm}{1.467cm}}{\pgfqpoint{0.233cm}{1.492cm}}
\pgfpathcurveto{\pgfqpoint{0.207cm}{1.518cm}}{\pgfqpoint{0.173cm}{1.532cm}}{\pgfqpoint{0.137cm}{1.532cm}}
\pgfpathcurveto{\pgfqpoint{0.1cm}{1.532cm}}{\pgfqpoint{0.066cm}{1.518cm}}{\pgfqpoint{0.04cm}{1.492cm}}
\pgfpathcurveto{\pgfqpoint{0.014cm}{1.467cm}}{\pgfqpoint{0cm}{1.432cm}}{\pgfqpoint{0cm}{1.395cm}}
\pgfpathcurveto{\pgfqpoint{0cm}{1.359cm}}{\pgfqpoint{0.014cm}{1.324cm}}{\pgfqpoint{0.04cm}{1.299cm}}
\pgfpathcurveto{\pgfqpoint{0.066cm}{1.273cm}}{\pgfqpoint{0.1cm}{1.258cm}}{\pgfqpoint{0.137cm}{1.258cm}}
\pgfpathcurveto{\pgfqpoint{0.173cm}{1.258cm}}{\pgfqpoint{0.207cm}{1.273cm}}{\pgfqpoint{0.233cm}{1.299cm}}
\pgfpathcurveto{\pgfqpoint{0.259cm}{1.324cm}}{\pgfqpoint{0.273cm}{1.359cm}}{\pgfqpoint{0.273cm}{1.395cm}}
\pgfusepath{fill}
\begin{pgfscope}
\pgfsetdash{}{0cm}
\pgfsetlinewidth{0.818mm}
\pgfsetmiterlimit{7.0}
\pgfpathmoveto{\pgfqpoint{0.682cm}{0.671cm}}
\pgfpathlineto{\pgfqpoint{0.679cm}{1.418cm}}
\pgfusepath{stroke}
\end{pgfscope}
\pgfpathmoveto{\pgfqpoint{0.815cm}{1.399cm}}
\pgfpathcurveto{\pgfqpoint{0.815cm}{1.435cm}}{\pgfqpoint{0.801cm}{1.47cm}}{\pgfqpoint{0.775cm}{1.496cm}}
\pgfpathcurveto{\pgfqpoint{0.75cm}{1.521cm}}{\pgfqpoint{0.715cm}{1.536cm}}{\pgfqpoint{0.679cm}{1.536cm}}
\pgfpathcurveto{\pgfqpoint{0.643cm}{1.536cm}}{\pgfqpoint{0.608cm}{1.521cm}}{\pgfqpoint{0.582cm}{1.496cm}}
\pgfpathcurveto{\pgfqpoint{0.557cm}{1.47cm}}{\pgfqpoint{0.542cm}{1.435cm}}{\pgfqpoint{0.542cm}{1.399cm}}
\pgfpathcurveto{\pgfqpoint{0.542cm}{1.363cm}}{\pgfqpoint{0.557cm}{1.328cm}}{\pgfqpoint{0.582cm}{1.302cm}}
\pgfpathcurveto{\pgfqpoint{0.608cm}{1.276cm}}{\pgfqpoint{0.643cm}{1.262cm}}{\pgfqpoint{0.679cm}{1.262cm}}
\pgfpathcurveto{\pgfqpoint{0.715cm}{1.262cm}}{\pgfqpoint{0.75cm}{1.276cm}}{\pgfqpoint{0.775cm}{1.302cm}}
\pgfpathcurveto{\pgfqpoint{0.801cm}{1.328cm}}{\pgfqpoint{0.815cm}{1.363cm}}{\pgfqpoint{0.815cm}{1.399cm}}
\pgfusepath{fill}
\pgfpathmoveto{\pgfqpoint{1.345cm}{1.371cm}}
\pgfpathcurveto{\pgfqpoint{1.345cm}{1.408cm}}{\pgfqpoint{1.331cm}{1.442cm}}{\pgfqpoint{1.305cm}{1.468cm}}
\pgfpathcurveto{\pgfqpoint{1.28cm}{1.494cm}}{\pgfqpoint{1.245cm}{1.508cm}}{\pgfqpoint{1.209cm}{1.508cm}}
\pgfpathcurveto{\pgfqpoint{1.172cm}{1.508cm}}{\pgfqpoint{1.138cm}{1.494cm}}{\pgfqpoint{1.112cm}{1.468cm}}
\pgfpathcurveto{\pgfqpoint{1.087cm}{1.442cm}}{\pgfqpoint{1.072cm}{1.408cm}}{\pgfqpoint{1.072cm}{1.371cm}}
\pgfpathcurveto{\pgfqpoint{1.072cm}{1.335cm}}{\pgfqpoint{1.087cm}{1.3cm}}{\pgfqpoint{1.112cm}{1.274cm}}
\pgfpathcurveto{\pgfqpoint{1.138cm}{1.249cm}}{\pgfqpoint{1.172cm}{1.234cm}}{\pgfqpoint{1.209cm}{1.234cm}}
\pgfpathcurveto{\pgfqpoint{1.245cm}{1.234cm}}{\pgfqpoint{1.28cm}{1.249cm}}{\pgfqpoint{1.305cm}{1.274cm}}
\pgfpathcurveto{\pgfqpoint{1.331cm}{1.3cm}}{\pgfqpoint{1.345cm}{1.335cm}}{\pgfqpoint{1.345cm}{1.371cm}}
\pgfusepath{fill}
\begin{pgfscope}
\pgfsetdash{}{0cm}
\pgfsetlinewidth{0.818mm}
\pgfsetroundcap
\pgfsetmiterlimit{4.0}
\pgfpathmoveto{\pgfqpoint{0.682cm}{0.671cm}}
\pgfpathlineto{\pgfqpoint{0.682cm}{0.042cm}}
\pgfusepath{stroke}
\end{pgfscope}
\end{pgfscope}
\end{pgfscope}
\end{pgfscope}
\end{tikzpicture}}}(\phi+\psi))-6X\circ(X^{\!\resizebox{0.6em}{!}{
\begin{tikzpicture}
\pgfpathmoveto{\pgfqpoint{0cm}{-0.035cm}}
\pgfpathlineto{\pgfqpoint{1.376cm}{-0.035cm}}
\pgfpathlineto{\pgfqpoint{1.376cm}{1.552cm}}
\pgfpathlineto{\pgfqpoint{0cm}{1.552cm}}
\pgfpathclose
\pgfusepath{clip}
\begin{pgfscope}
\begin{pgfscope}
\pgfpathmoveto{\pgfqpoint{0cm}{-0.035cm}}
\pgfpathlineto{\pgfqpoint{1.376cm}{-0.035cm}}
\pgfpathlineto{\pgfqpoint{1.376cm}{1.552cm}}
\pgfpathlineto{\pgfqpoint{0cm}{1.552cm}}
\pgfpathclose
\pgfusepath{clip}
\begin{pgfscope}
\begin{pgfscope}
\pgfsetdash{}{0cm}
\pgfsetlinewidth{0.818mm}
\pgfsetroundcap
\pgfsetroundjoin
\pgfsetmiterlimit{7.0}
\definecolor{eps2pgf_color}{gray}{0}\pgfsetstrokecolor{eps2pgf_color}\pgfsetfillcolor{eps2pgf_color}
\pgfpathmoveto{\pgfqpoint{0.117cm}{1.421cm}}
\pgfpathlineto{\pgfqpoint{0.682cm}{0.671cm}}
\pgfpathlineto{\pgfqpoint{1.246cm}{1.421cm}}
\pgfusepath{stroke}
\end{pgfscope}
\definecolor{eps2pgf_color}{gray}{0}\pgfsetstrokecolor{eps2pgf_color}\pgfsetfillcolor{eps2pgf_color}
\pgfpathmoveto{\pgfqpoint{0.273cm}{1.395cm}}
\pgfpathcurveto{\pgfqpoint{0.273cm}{1.432cm}}{\pgfqpoint{0.259cm}{1.467cm}}{\pgfqpoint{0.233cm}{1.492cm}}
\pgfpathcurveto{\pgfqpoint{0.207cm}{1.518cm}}{\pgfqpoint{0.173cm}{1.532cm}}{\pgfqpoint{0.137cm}{1.532cm}}
\pgfpathcurveto{\pgfqpoint{0.1cm}{1.532cm}}{\pgfqpoint{0.066cm}{1.518cm}}{\pgfqpoint{0.04cm}{1.492cm}}
\pgfpathcurveto{\pgfqpoint{0.014cm}{1.467cm}}{\pgfqpoint{0cm}{1.432cm}}{\pgfqpoint{0cm}{1.395cm}}
\pgfpathcurveto{\pgfqpoint{0cm}{1.359cm}}{\pgfqpoint{0.014cm}{1.324cm}}{\pgfqpoint{0.04cm}{1.299cm}}
\pgfpathcurveto{\pgfqpoint{0.066cm}{1.273cm}}{\pgfqpoint{0.1cm}{1.258cm}}{\pgfqpoint{0.137cm}{1.258cm}}
\pgfpathcurveto{\pgfqpoint{0.173cm}{1.258cm}}{\pgfqpoint{0.207cm}{1.273cm}}{\pgfqpoint{0.233cm}{1.299cm}}
\pgfpathcurveto{\pgfqpoint{0.259cm}{1.324cm}}{\pgfqpoint{0.273cm}{1.359cm}}{\pgfqpoint{0.273cm}{1.395cm}}
\pgfusepath{fill}
\begin{pgfscope}
\pgfsetdash{}{0cm}
\pgfsetlinewidth{0.818mm}
\pgfsetmiterlimit{7.0}
\pgfpathmoveto{\pgfqpoint{0.682cm}{0.671cm}}
\pgfpathlineto{\pgfqpoint{0.679cm}{1.418cm}}
\pgfusepath{stroke}
\end{pgfscope}
\pgfpathmoveto{\pgfqpoint{0.815cm}{1.399cm}}
\pgfpathcurveto{\pgfqpoint{0.815cm}{1.435cm}}{\pgfqpoint{0.801cm}{1.47cm}}{\pgfqpoint{0.775cm}{1.496cm}}
\pgfpathcurveto{\pgfqpoint{0.75cm}{1.521cm}}{\pgfqpoint{0.715cm}{1.536cm}}{\pgfqpoint{0.679cm}{1.536cm}}
\pgfpathcurveto{\pgfqpoint{0.643cm}{1.536cm}}{\pgfqpoint{0.608cm}{1.521cm}}{\pgfqpoint{0.582cm}{1.496cm}}
\pgfpathcurveto{\pgfqpoint{0.557cm}{1.47cm}}{\pgfqpoint{0.542cm}{1.435cm}}{\pgfqpoint{0.542cm}{1.399cm}}
\pgfpathcurveto{\pgfqpoint{0.542cm}{1.363cm}}{\pgfqpoint{0.557cm}{1.328cm}}{\pgfqpoint{0.582cm}{1.302cm}}
\pgfpathcurveto{\pgfqpoint{0.608cm}{1.276cm}}{\pgfqpoint{0.643cm}{1.262cm}}{\pgfqpoint{0.679cm}{1.262cm}}
\pgfpathcurveto{\pgfqpoint{0.715cm}{1.262cm}}{\pgfqpoint{0.75cm}{1.276cm}}{\pgfqpoint{0.775cm}{1.302cm}}
\pgfpathcurveto{\pgfqpoint{0.801cm}{1.328cm}}{\pgfqpoint{0.815cm}{1.363cm}}{\pgfqpoint{0.815cm}{1.399cm}}
\pgfusepath{fill}
\pgfpathmoveto{\pgfqpoint{1.345cm}{1.371cm}}
\pgfpathcurveto{\pgfqpoint{1.345cm}{1.408cm}}{\pgfqpoint{1.331cm}{1.442cm}}{\pgfqpoint{1.305cm}{1.468cm}}
\pgfpathcurveto{\pgfqpoint{1.28cm}{1.494cm}}{\pgfqpoint{1.245cm}{1.508cm}}{\pgfqpoint{1.209cm}{1.508cm}}
\pgfpathcurveto{\pgfqpoint{1.172cm}{1.508cm}}{\pgfqpoint{1.138cm}{1.494cm}}{\pgfqpoint{1.112cm}{1.468cm}}
\pgfpathcurveto{\pgfqpoint{1.087cm}{1.442cm}}{\pgfqpoint{1.072cm}{1.408cm}}{\pgfqpoint{1.072cm}{1.371cm}}
\pgfpathcurveto{\pgfqpoint{1.072cm}{1.335cm}}{\pgfqpoint{1.087cm}{1.3cm}}{\pgfqpoint{1.112cm}{1.274cm}}
\pgfpathcurveto{\pgfqpoint{1.138cm}{1.249cm}}{\pgfqpoint{1.172cm}{1.234cm}}{\pgfqpoint{1.209cm}{1.234cm}}
\pgfpathcurveto{\pgfqpoint{1.245cm}{1.234cm}}{\pgfqpoint{1.28cm}{1.249cm}}{\pgfqpoint{1.305cm}{1.274cm}}
\pgfpathcurveto{\pgfqpoint{1.331cm}{1.3cm}}{\pgfqpoint{1.345cm}{1.335cm}}{\pgfqpoint{1.345cm}{1.371cm}}
\pgfusepath{fill}
\begin{pgfscope}
\pgfsetdash{}{0cm}
\pgfsetlinewidth{0.818mm}
\pgfsetroundcap
\pgfsetmiterlimit{4.0}
\pgfpathmoveto{\pgfqpoint{0.682cm}{0.671cm}}
\pgfpathlineto{\pgfqpoint{0.682cm}{0.042cm}}
\pgfusepath{stroke}
\end{pgfscope}
\end{pgfscope}
\end{pgfscope}
\end{pgfscope}
\end{tikzpicture}}}(\phi+\psi))\\
&\quad+3X(\phi+\psi)^2,
\end{align*}
where
$$
3X\circ(X^{\!\resizebox{0.6em}{!}{
\begin{tikzpicture}
\pgfpathmoveto{\pgfqpoint{0cm}{-0.035cm}}
\pgfpathlineto{\pgfqpoint{1.376cm}{-0.035cm}}
\pgfpathlineto{\pgfqpoint{1.376cm}{1.552cm}}
\pgfpathlineto{\pgfqpoint{0cm}{1.552cm}}
\pgfpathclose
\pgfusepath{clip}
\begin{pgfscope}
\begin{pgfscope}
\pgfpathmoveto{\pgfqpoint{0cm}{-0.035cm}}
\pgfpathlineto{\pgfqpoint{1.376cm}{-0.035cm}}
\pgfpathlineto{\pgfqpoint{1.376cm}{1.552cm}}
\pgfpathlineto{\pgfqpoint{0cm}{1.552cm}}
\pgfpathclose
\pgfusepath{clip}
\begin{pgfscope}
\begin{pgfscope}
\pgfsetdash{}{0cm}
\pgfsetlinewidth{0.818mm}
\pgfsetroundcap
\pgfsetroundjoin
\pgfsetmiterlimit{7.0}
\definecolor{eps2pgf_color}{gray}{0}\pgfsetstrokecolor{eps2pgf_color}\pgfsetfillcolor{eps2pgf_color}
\pgfpathmoveto{\pgfqpoint{0.117cm}{1.421cm}}
\pgfpathlineto{\pgfqpoint{0.682cm}{0.671cm}}
\pgfpathlineto{\pgfqpoint{1.246cm}{1.421cm}}
\pgfusepath{stroke}
\end{pgfscope}
\definecolor{eps2pgf_color}{gray}{0}\pgfsetstrokecolor{eps2pgf_color}\pgfsetfillcolor{eps2pgf_color}
\pgfpathmoveto{\pgfqpoint{0.273cm}{1.395cm}}
\pgfpathcurveto{\pgfqpoint{0.273cm}{1.432cm}}{\pgfqpoint{0.259cm}{1.467cm}}{\pgfqpoint{0.233cm}{1.492cm}}
\pgfpathcurveto{\pgfqpoint{0.207cm}{1.518cm}}{\pgfqpoint{0.173cm}{1.532cm}}{\pgfqpoint{0.137cm}{1.532cm}}
\pgfpathcurveto{\pgfqpoint{0.1cm}{1.532cm}}{\pgfqpoint{0.066cm}{1.518cm}}{\pgfqpoint{0.04cm}{1.492cm}}
\pgfpathcurveto{\pgfqpoint{0.014cm}{1.467cm}}{\pgfqpoint{0cm}{1.432cm}}{\pgfqpoint{0cm}{1.395cm}}
\pgfpathcurveto{\pgfqpoint{0cm}{1.359cm}}{\pgfqpoint{0.014cm}{1.324cm}}{\pgfqpoint{0.04cm}{1.299cm}}
\pgfpathcurveto{\pgfqpoint{0.066cm}{1.273cm}}{\pgfqpoint{0.1cm}{1.258cm}}{\pgfqpoint{0.137cm}{1.258cm}}
\pgfpathcurveto{\pgfqpoint{0.173cm}{1.258cm}}{\pgfqpoint{0.207cm}{1.273cm}}{\pgfqpoint{0.233cm}{1.299cm}}
\pgfpathcurveto{\pgfqpoint{0.259cm}{1.324cm}}{\pgfqpoint{0.273cm}{1.359cm}}{\pgfqpoint{0.273cm}{1.395cm}}
\pgfusepath{fill}
\begin{pgfscope}
\pgfsetdash{}{0cm}
\pgfsetlinewidth{0.818mm}
\pgfsetmiterlimit{7.0}
\pgfpathmoveto{\pgfqpoint{0.682cm}{0.671cm}}
\pgfpathlineto{\pgfqpoint{0.679cm}{1.418cm}}
\pgfusepath{stroke}
\end{pgfscope}
\pgfpathmoveto{\pgfqpoint{0.815cm}{1.399cm}}
\pgfpathcurveto{\pgfqpoint{0.815cm}{1.435cm}}{\pgfqpoint{0.801cm}{1.47cm}}{\pgfqpoint{0.775cm}{1.496cm}}
\pgfpathcurveto{\pgfqpoint{0.75cm}{1.521cm}}{\pgfqpoint{0.715cm}{1.536cm}}{\pgfqpoint{0.679cm}{1.536cm}}
\pgfpathcurveto{\pgfqpoint{0.643cm}{1.536cm}}{\pgfqpoint{0.608cm}{1.521cm}}{\pgfqpoint{0.582cm}{1.496cm}}
\pgfpathcurveto{\pgfqpoint{0.557cm}{1.47cm}}{\pgfqpoint{0.542cm}{1.435cm}}{\pgfqpoint{0.542cm}{1.399cm}}
\pgfpathcurveto{\pgfqpoint{0.542cm}{1.363cm}}{\pgfqpoint{0.557cm}{1.328cm}}{\pgfqpoint{0.582cm}{1.302cm}}
\pgfpathcurveto{\pgfqpoint{0.608cm}{1.276cm}}{\pgfqpoint{0.643cm}{1.262cm}}{\pgfqpoint{0.679cm}{1.262cm}}
\pgfpathcurveto{\pgfqpoint{0.715cm}{1.262cm}}{\pgfqpoint{0.75cm}{1.276cm}}{\pgfqpoint{0.775cm}{1.302cm}}
\pgfpathcurveto{\pgfqpoint{0.801cm}{1.328cm}}{\pgfqpoint{0.815cm}{1.363cm}}{\pgfqpoint{0.815cm}{1.399cm}}
\pgfusepath{fill}
\pgfpathmoveto{\pgfqpoint{1.345cm}{1.371cm}}
\pgfpathcurveto{\pgfqpoint{1.345cm}{1.408cm}}{\pgfqpoint{1.331cm}{1.442cm}}{\pgfqpoint{1.305cm}{1.468cm}}
\pgfpathcurveto{\pgfqpoint{1.28cm}{1.494cm}}{\pgfqpoint{1.245cm}{1.508cm}}{\pgfqpoint{1.209cm}{1.508cm}}
\pgfpathcurveto{\pgfqpoint{1.172cm}{1.508cm}}{\pgfqpoint{1.138cm}{1.494cm}}{\pgfqpoint{1.112cm}{1.468cm}}
\pgfpathcurveto{\pgfqpoint{1.087cm}{1.442cm}}{\pgfqpoint{1.072cm}{1.408cm}}{\pgfqpoint{1.072cm}{1.371cm}}
\pgfpathcurveto{\pgfqpoint{1.072cm}{1.335cm}}{\pgfqpoint{1.087cm}{1.3cm}}{\pgfqpoint{1.112cm}{1.274cm}}
\pgfpathcurveto{\pgfqpoint{1.138cm}{1.249cm}}{\pgfqpoint{1.172cm}{1.234cm}}{\pgfqpoint{1.209cm}{1.234cm}}
\pgfpathcurveto{\pgfqpoint{1.245cm}{1.234cm}}{\pgfqpoint{1.28cm}{1.249cm}}{\pgfqpoint{1.305cm}{1.274cm}}
\pgfpathcurveto{\pgfqpoint{1.331cm}{1.3cm}}{\pgfqpoint{1.345cm}{1.335cm}}{\pgfqpoint{1.345cm}{1.371cm}}
\pgfusepath{fill}
\begin{pgfscope}
\pgfsetdash{}{0cm}
\pgfsetlinewidth{0.818mm}
\pgfsetroundcap
\pgfsetmiterlimit{4.0}
\pgfpathmoveto{\pgfqpoint{0.682cm}{0.671cm}}
\pgfpathlineto{\pgfqpoint{0.682cm}{0.042cm}}
\pgfusepath{stroke}
\end{pgfscope}
\end{pgfscope}
\end{pgfscope}
\end{pgfscope}
\end{tikzpicture}}})^2=6X\circ(X^{\!\resizebox{0.6em}{!}{
\begin{tikzpicture}
\pgfpathmoveto{\pgfqpoint{0cm}{-0.035cm}}
\pgfpathlineto{\pgfqpoint{1.376cm}{-0.035cm}}
\pgfpathlineto{\pgfqpoint{1.376cm}{1.552cm}}
\pgfpathlineto{\pgfqpoint{0cm}{1.552cm}}
\pgfpathclose
\pgfusepath{clip}
\begin{pgfscope}
\begin{pgfscope}
\pgfpathmoveto{\pgfqpoint{0cm}{-0.035cm}}
\pgfpathlineto{\pgfqpoint{1.376cm}{-0.035cm}}
\pgfpathlineto{\pgfqpoint{1.376cm}{1.552cm}}
\pgfpathlineto{\pgfqpoint{0cm}{1.552cm}}
\pgfpathclose
\pgfusepath{clip}
\begin{pgfscope}
\begin{pgfscope}
\pgfsetdash{}{0cm}
\pgfsetlinewidth{0.818mm}
\pgfsetroundcap
\pgfsetroundjoin
\pgfsetmiterlimit{7.0}
\definecolor{eps2pgf_color}{gray}{0}\pgfsetstrokecolor{eps2pgf_color}\pgfsetfillcolor{eps2pgf_color}
\pgfpathmoveto{\pgfqpoint{0.117cm}{1.421cm}}
\pgfpathlineto{\pgfqpoint{0.682cm}{0.671cm}}
\pgfpathlineto{\pgfqpoint{1.246cm}{1.421cm}}
\pgfusepath{stroke}
\end{pgfscope}
\definecolor{eps2pgf_color}{gray}{0}\pgfsetstrokecolor{eps2pgf_color}\pgfsetfillcolor{eps2pgf_color}
\pgfpathmoveto{\pgfqpoint{0.273cm}{1.395cm}}
\pgfpathcurveto{\pgfqpoint{0.273cm}{1.432cm}}{\pgfqpoint{0.259cm}{1.467cm}}{\pgfqpoint{0.233cm}{1.492cm}}
\pgfpathcurveto{\pgfqpoint{0.207cm}{1.518cm}}{\pgfqpoint{0.173cm}{1.532cm}}{\pgfqpoint{0.137cm}{1.532cm}}
\pgfpathcurveto{\pgfqpoint{0.1cm}{1.532cm}}{\pgfqpoint{0.066cm}{1.518cm}}{\pgfqpoint{0.04cm}{1.492cm}}
\pgfpathcurveto{\pgfqpoint{0.014cm}{1.467cm}}{\pgfqpoint{0cm}{1.432cm}}{\pgfqpoint{0cm}{1.395cm}}
\pgfpathcurveto{\pgfqpoint{0cm}{1.359cm}}{\pgfqpoint{0.014cm}{1.324cm}}{\pgfqpoint{0.04cm}{1.299cm}}
\pgfpathcurveto{\pgfqpoint{0.066cm}{1.273cm}}{\pgfqpoint{0.1cm}{1.258cm}}{\pgfqpoint{0.137cm}{1.258cm}}
\pgfpathcurveto{\pgfqpoint{0.173cm}{1.258cm}}{\pgfqpoint{0.207cm}{1.273cm}}{\pgfqpoint{0.233cm}{1.299cm}}
\pgfpathcurveto{\pgfqpoint{0.259cm}{1.324cm}}{\pgfqpoint{0.273cm}{1.359cm}}{\pgfqpoint{0.273cm}{1.395cm}}
\pgfusepath{fill}
\begin{pgfscope}
\pgfsetdash{}{0cm}
\pgfsetlinewidth{0.818mm}
\pgfsetmiterlimit{7.0}
\pgfpathmoveto{\pgfqpoint{0.682cm}{0.671cm}}
\pgfpathlineto{\pgfqpoint{0.679cm}{1.418cm}}
\pgfusepath{stroke}
\end{pgfscope}
\pgfpathmoveto{\pgfqpoint{0.815cm}{1.399cm}}
\pgfpathcurveto{\pgfqpoint{0.815cm}{1.435cm}}{\pgfqpoint{0.801cm}{1.47cm}}{\pgfqpoint{0.775cm}{1.496cm}}
\pgfpathcurveto{\pgfqpoint{0.75cm}{1.521cm}}{\pgfqpoint{0.715cm}{1.536cm}}{\pgfqpoint{0.679cm}{1.536cm}}
\pgfpathcurveto{\pgfqpoint{0.643cm}{1.536cm}}{\pgfqpoint{0.608cm}{1.521cm}}{\pgfqpoint{0.582cm}{1.496cm}}
\pgfpathcurveto{\pgfqpoint{0.557cm}{1.47cm}}{\pgfqpoint{0.542cm}{1.435cm}}{\pgfqpoint{0.542cm}{1.399cm}}
\pgfpathcurveto{\pgfqpoint{0.542cm}{1.363cm}}{\pgfqpoint{0.557cm}{1.328cm}}{\pgfqpoint{0.582cm}{1.302cm}}
\pgfpathcurveto{\pgfqpoint{0.608cm}{1.276cm}}{\pgfqpoint{0.643cm}{1.262cm}}{\pgfqpoint{0.679cm}{1.262cm}}
\pgfpathcurveto{\pgfqpoint{0.715cm}{1.262cm}}{\pgfqpoint{0.75cm}{1.276cm}}{\pgfqpoint{0.775cm}{1.302cm}}
\pgfpathcurveto{\pgfqpoint{0.801cm}{1.328cm}}{\pgfqpoint{0.815cm}{1.363cm}}{\pgfqpoint{0.815cm}{1.399cm}}
\pgfusepath{fill}
\pgfpathmoveto{\pgfqpoint{1.345cm}{1.371cm}}
\pgfpathcurveto{\pgfqpoint{1.345cm}{1.408cm}}{\pgfqpoint{1.331cm}{1.442cm}}{\pgfqpoint{1.305cm}{1.468cm}}
\pgfpathcurveto{\pgfqpoint{1.28cm}{1.494cm}}{\pgfqpoint{1.245cm}{1.508cm}}{\pgfqpoint{1.209cm}{1.508cm}}
\pgfpathcurveto{\pgfqpoint{1.172cm}{1.508cm}}{\pgfqpoint{1.138cm}{1.494cm}}{\pgfqpoint{1.112cm}{1.468cm}}
\pgfpathcurveto{\pgfqpoint{1.087cm}{1.442cm}}{\pgfqpoint{1.072cm}{1.408cm}}{\pgfqpoint{1.072cm}{1.371cm}}
\pgfpathcurveto{\pgfqpoint{1.072cm}{1.335cm}}{\pgfqpoint{1.087cm}{1.3cm}}{\pgfqpoint{1.112cm}{1.274cm}}
\pgfpathcurveto{\pgfqpoint{1.138cm}{1.249cm}}{\pgfqpoint{1.172cm}{1.234cm}}{\pgfqpoint{1.209cm}{1.234cm}}
\pgfpathcurveto{\pgfqpoint{1.245cm}{1.234cm}}{\pgfqpoint{1.28cm}{1.249cm}}{\pgfqpoint{1.305cm}{1.274cm}}
\pgfpathcurveto{\pgfqpoint{1.331cm}{1.3cm}}{\pgfqpoint{1.345cm}{1.335cm}}{\pgfqpoint{1.345cm}{1.371cm}}
\pgfusepath{fill}
\begin{pgfscope}
\pgfsetdash{}{0cm}
\pgfsetlinewidth{0.818mm}
\pgfsetroundcap
\pgfsetmiterlimit{4.0}
\pgfpathmoveto{\pgfqpoint{0.682cm}{0.671cm}}
\pgfpathlineto{\pgfqpoint{0.682cm}{0.042cm}}
\pgfusepath{stroke}
\end{pgfscope}
\end{pgfscope}
\end{pgfscope}
\end{pgfscope}
\end{tikzpicture}}}\prec X^{\!\resizebox{0.6em}{!}{
\begin{tikzpicture}
\pgfpathmoveto{\pgfqpoint{0cm}{-0.035cm}}
\pgfpathlineto{\pgfqpoint{1.376cm}{-0.035cm}}
\pgfpathlineto{\pgfqpoint{1.376cm}{1.552cm}}
\pgfpathlineto{\pgfqpoint{0cm}{1.552cm}}
\pgfpathclose
\pgfusepath{clip}
\begin{pgfscope}
\begin{pgfscope}
\pgfpathmoveto{\pgfqpoint{0cm}{-0.035cm}}
\pgfpathlineto{\pgfqpoint{1.376cm}{-0.035cm}}
\pgfpathlineto{\pgfqpoint{1.376cm}{1.552cm}}
\pgfpathlineto{\pgfqpoint{0cm}{1.552cm}}
\pgfpathclose
\pgfusepath{clip}
\begin{pgfscope}
\begin{pgfscope}
\pgfsetdash{}{0cm}
\pgfsetlinewidth{0.818mm}
\pgfsetroundcap
\pgfsetroundjoin
\pgfsetmiterlimit{7.0}
\definecolor{eps2pgf_color}{gray}{0}\pgfsetstrokecolor{eps2pgf_color}\pgfsetfillcolor{eps2pgf_color}
\pgfpathmoveto{\pgfqpoint{0.117cm}{1.421cm}}
\pgfpathlineto{\pgfqpoint{0.682cm}{0.671cm}}
\pgfpathlineto{\pgfqpoint{1.246cm}{1.421cm}}
\pgfusepath{stroke}
\end{pgfscope}
\definecolor{eps2pgf_color}{gray}{0}\pgfsetstrokecolor{eps2pgf_color}\pgfsetfillcolor{eps2pgf_color}
\pgfpathmoveto{\pgfqpoint{0.273cm}{1.395cm}}
\pgfpathcurveto{\pgfqpoint{0.273cm}{1.432cm}}{\pgfqpoint{0.259cm}{1.467cm}}{\pgfqpoint{0.233cm}{1.492cm}}
\pgfpathcurveto{\pgfqpoint{0.207cm}{1.518cm}}{\pgfqpoint{0.173cm}{1.532cm}}{\pgfqpoint{0.137cm}{1.532cm}}
\pgfpathcurveto{\pgfqpoint{0.1cm}{1.532cm}}{\pgfqpoint{0.066cm}{1.518cm}}{\pgfqpoint{0.04cm}{1.492cm}}
\pgfpathcurveto{\pgfqpoint{0.014cm}{1.467cm}}{\pgfqpoint{0cm}{1.432cm}}{\pgfqpoint{0cm}{1.395cm}}
\pgfpathcurveto{\pgfqpoint{0cm}{1.359cm}}{\pgfqpoint{0.014cm}{1.324cm}}{\pgfqpoint{0.04cm}{1.299cm}}
\pgfpathcurveto{\pgfqpoint{0.066cm}{1.273cm}}{\pgfqpoint{0.1cm}{1.258cm}}{\pgfqpoint{0.137cm}{1.258cm}}
\pgfpathcurveto{\pgfqpoint{0.173cm}{1.258cm}}{\pgfqpoint{0.207cm}{1.273cm}}{\pgfqpoint{0.233cm}{1.299cm}}
\pgfpathcurveto{\pgfqpoint{0.259cm}{1.324cm}}{\pgfqpoint{0.273cm}{1.359cm}}{\pgfqpoint{0.273cm}{1.395cm}}
\pgfusepath{fill}
\begin{pgfscope}
\pgfsetdash{}{0cm}
\pgfsetlinewidth{0.818mm}
\pgfsetmiterlimit{7.0}
\pgfpathmoveto{\pgfqpoint{0.682cm}{0.671cm}}
\pgfpathlineto{\pgfqpoint{0.679cm}{1.418cm}}
\pgfusepath{stroke}
\end{pgfscope}
\pgfpathmoveto{\pgfqpoint{0.815cm}{1.399cm}}
\pgfpathcurveto{\pgfqpoint{0.815cm}{1.435cm}}{\pgfqpoint{0.801cm}{1.47cm}}{\pgfqpoint{0.775cm}{1.496cm}}
\pgfpathcurveto{\pgfqpoint{0.75cm}{1.521cm}}{\pgfqpoint{0.715cm}{1.536cm}}{\pgfqpoint{0.679cm}{1.536cm}}
\pgfpathcurveto{\pgfqpoint{0.643cm}{1.536cm}}{\pgfqpoint{0.608cm}{1.521cm}}{\pgfqpoint{0.582cm}{1.496cm}}
\pgfpathcurveto{\pgfqpoint{0.557cm}{1.47cm}}{\pgfqpoint{0.542cm}{1.435cm}}{\pgfqpoint{0.542cm}{1.399cm}}
\pgfpathcurveto{\pgfqpoint{0.542cm}{1.363cm}}{\pgfqpoint{0.557cm}{1.328cm}}{\pgfqpoint{0.582cm}{1.302cm}}
\pgfpathcurveto{\pgfqpoint{0.608cm}{1.276cm}}{\pgfqpoint{0.643cm}{1.262cm}}{\pgfqpoint{0.679cm}{1.262cm}}
\pgfpathcurveto{\pgfqpoint{0.715cm}{1.262cm}}{\pgfqpoint{0.75cm}{1.276cm}}{\pgfqpoint{0.775cm}{1.302cm}}
\pgfpathcurveto{\pgfqpoint{0.801cm}{1.328cm}}{\pgfqpoint{0.815cm}{1.363cm}}{\pgfqpoint{0.815cm}{1.399cm}}
\pgfusepath{fill}
\pgfpathmoveto{\pgfqpoint{1.345cm}{1.371cm}}
\pgfpathcurveto{\pgfqpoint{1.345cm}{1.408cm}}{\pgfqpoint{1.331cm}{1.442cm}}{\pgfqpoint{1.305cm}{1.468cm}}
\pgfpathcurveto{\pgfqpoint{1.28cm}{1.494cm}}{\pgfqpoint{1.245cm}{1.508cm}}{\pgfqpoint{1.209cm}{1.508cm}}
\pgfpathcurveto{\pgfqpoint{1.172cm}{1.508cm}}{\pgfqpoint{1.138cm}{1.494cm}}{\pgfqpoint{1.112cm}{1.468cm}}
\pgfpathcurveto{\pgfqpoint{1.087cm}{1.442cm}}{\pgfqpoint{1.072cm}{1.408cm}}{\pgfqpoint{1.072cm}{1.371cm}}
\pgfpathcurveto{\pgfqpoint{1.072cm}{1.335cm}}{\pgfqpoint{1.087cm}{1.3cm}}{\pgfqpoint{1.112cm}{1.274cm}}
\pgfpathcurveto{\pgfqpoint{1.138cm}{1.249cm}}{\pgfqpoint{1.172cm}{1.234cm}}{\pgfqpoint{1.209cm}{1.234cm}}
\pgfpathcurveto{\pgfqpoint{1.245cm}{1.234cm}}{\pgfqpoint{1.28cm}{1.249cm}}{\pgfqpoint{1.305cm}{1.274cm}}
\pgfpathcurveto{\pgfqpoint{1.331cm}{1.3cm}}{\pgfqpoint{1.345cm}{1.335cm}}{\pgfqpoint{1.345cm}{1.371cm}}
\pgfusepath{fill}
\begin{pgfscope}
\pgfsetdash{}{0cm}
\pgfsetlinewidth{0.818mm}
\pgfsetroundcap
\pgfsetmiterlimit{4.0}
\pgfpathmoveto{\pgfqpoint{0.682cm}{0.671cm}}
\pgfpathlineto{\pgfqpoint{0.682cm}{0.042cm}}
\pgfusepath{stroke}
\end{pgfscope}
\end{pgfscope}
\end{pgfscope}
\end{pgfscope}
\end{tikzpicture}}})+3X\circ (X^{\!\resizebox{0.6em}{!}{
\begin{tikzpicture}
\pgfpathmoveto{\pgfqpoint{0cm}{-0.035cm}}
\pgfpathlineto{\pgfqpoint{1.376cm}{-0.035cm}}
\pgfpathlineto{\pgfqpoint{1.376cm}{1.552cm}}
\pgfpathlineto{\pgfqpoint{0cm}{1.552cm}}
\pgfpathclose
\pgfusepath{clip}
\begin{pgfscope}
\begin{pgfscope}
\pgfpathmoveto{\pgfqpoint{0cm}{-0.035cm}}
\pgfpathlineto{\pgfqpoint{1.376cm}{-0.035cm}}
\pgfpathlineto{\pgfqpoint{1.376cm}{1.552cm}}
\pgfpathlineto{\pgfqpoint{0cm}{1.552cm}}
\pgfpathclose
\pgfusepath{clip}
\begin{pgfscope}
\begin{pgfscope}
\pgfsetdash{}{0cm}
\pgfsetlinewidth{0.818mm}
\pgfsetroundcap
\pgfsetroundjoin
\pgfsetmiterlimit{7.0}
\definecolor{eps2pgf_color}{gray}{0}\pgfsetstrokecolor{eps2pgf_color}\pgfsetfillcolor{eps2pgf_color}
\pgfpathmoveto{\pgfqpoint{0.117cm}{1.421cm}}
\pgfpathlineto{\pgfqpoint{0.682cm}{0.671cm}}
\pgfpathlineto{\pgfqpoint{1.246cm}{1.421cm}}
\pgfusepath{stroke}
\end{pgfscope}
\definecolor{eps2pgf_color}{gray}{0}\pgfsetstrokecolor{eps2pgf_color}\pgfsetfillcolor{eps2pgf_color}
\pgfpathmoveto{\pgfqpoint{0.273cm}{1.395cm}}
\pgfpathcurveto{\pgfqpoint{0.273cm}{1.432cm}}{\pgfqpoint{0.259cm}{1.467cm}}{\pgfqpoint{0.233cm}{1.492cm}}
\pgfpathcurveto{\pgfqpoint{0.207cm}{1.518cm}}{\pgfqpoint{0.173cm}{1.532cm}}{\pgfqpoint{0.137cm}{1.532cm}}
\pgfpathcurveto{\pgfqpoint{0.1cm}{1.532cm}}{\pgfqpoint{0.066cm}{1.518cm}}{\pgfqpoint{0.04cm}{1.492cm}}
\pgfpathcurveto{\pgfqpoint{0.014cm}{1.467cm}}{\pgfqpoint{0cm}{1.432cm}}{\pgfqpoint{0cm}{1.395cm}}
\pgfpathcurveto{\pgfqpoint{0cm}{1.359cm}}{\pgfqpoint{0.014cm}{1.324cm}}{\pgfqpoint{0.04cm}{1.299cm}}
\pgfpathcurveto{\pgfqpoint{0.066cm}{1.273cm}}{\pgfqpoint{0.1cm}{1.258cm}}{\pgfqpoint{0.137cm}{1.258cm}}
\pgfpathcurveto{\pgfqpoint{0.173cm}{1.258cm}}{\pgfqpoint{0.207cm}{1.273cm}}{\pgfqpoint{0.233cm}{1.299cm}}
\pgfpathcurveto{\pgfqpoint{0.259cm}{1.324cm}}{\pgfqpoint{0.273cm}{1.359cm}}{\pgfqpoint{0.273cm}{1.395cm}}
\pgfusepath{fill}
\begin{pgfscope}
\pgfsetdash{}{0cm}
\pgfsetlinewidth{0.818mm}
\pgfsetmiterlimit{7.0}
\pgfpathmoveto{\pgfqpoint{0.682cm}{0.671cm}}
\pgfpathlineto{\pgfqpoint{0.679cm}{1.418cm}}
\pgfusepath{stroke}
\end{pgfscope}
\pgfpathmoveto{\pgfqpoint{0.815cm}{1.399cm}}
\pgfpathcurveto{\pgfqpoint{0.815cm}{1.435cm}}{\pgfqpoint{0.801cm}{1.47cm}}{\pgfqpoint{0.775cm}{1.496cm}}
\pgfpathcurveto{\pgfqpoint{0.75cm}{1.521cm}}{\pgfqpoint{0.715cm}{1.536cm}}{\pgfqpoint{0.679cm}{1.536cm}}
\pgfpathcurveto{\pgfqpoint{0.643cm}{1.536cm}}{\pgfqpoint{0.608cm}{1.521cm}}{\pgfqpoint{0.582cm}{1.496cm}}
\pgfpathcurveto{\pgfqpoint{0.557cm}{1.47cm}}{\pgfqpoint{0.542cm}{1.435cm}}{\pgfqpoint{0.542cm}{1.399cm}}
\pgfpathcurveto{\pgfqpoint{0.542cm}{1.363cm}}{\pgfqpoint{0.557cm}{1.328cm}}{\pgfqpoint{0.582cm}{1.302cm}}
\pgfpathcurveto{\pgfqpoint{0.608cm}{1.276cm}}{\pgfqpoint{0.643cm}{1.262cm}}{\pgfqpoint{0.679cm}{1.262cm}}
\pgfpathcurveto{\pgfqpoint{0.715cm}{1.262cm}}{\pgfqpoint{0.75cm}{1.276cm}}{\pgfqpoint{0.775cm}{1.302cm}}
\pgfpathcurveto{\pgfqpoint{0.801cm}{1.328cm}}{\pgfqpoint{0.815cm}{1.363cm}}{\pgfqpoint{0.815cm}{1.399cm}}
\pgfusepath{fill}
\pgfpathmoveto{\pgfqpoint{1.345cm}{1.371cm}}
\pgfpathcurveto{\pgfqpoint{1.345cm}{1.408cm}}{\pgfqpoint{1.331cm}{1.442cm}}{\pgfqpoint{1.305cm}{1.468cm}}
\pgfpathcurveto{\pgfqpoint{1.28cm}{1.494cm}}{\pgfqpoint{1.245cm}{1.508cm}}{\pgfqpoint{1.209cm}{1.508cm}}
\pgfpathcurveto{\pgfqpoint{1.172cm}{1.508cm}}{\pgfqpoint{1.138cm}{1.494cm}}{\pgfqpoint{1.112cm}{1.468cm}}
\pgfpathcurveto{\pgfqpoint{1.087cm}{1.442cm}}{\pgfqpoint{1.072cm}{1.408cm}}{\pgfqpoint{1.072cm}{1.371cm}}
\pgfpathcurveto{\pgfqpoint{1.072cm}{1.335cm}}{\pgfqpoint{1.087cm}{1.3cm}}{\pgfqpoint{1.112cm}{1.274cm}}
\pgfpathcurveto{\pgfqpoint{1.138cm}{1.249cm}}{\pgfqpoint{1.172cm}{1.234cm}}{\pgfqpoint{1.209cm}{1.234cm}}
\pgfpathcurveto{\pgfqpoint{1.245cm}{1.234cm}}{\pgfqpoint{1.28cm}{1.249cm}}{\pgfqpoint{1.305cm}{1.274cm}}
\pgfpathcurveto{\pgfqpoint{1.331cm}{1.3cm}}{\pgfqpoint{1.345cm}{1.335cm}}{\pgfqpoint{1.345cm}{1.371cm}}
\pgfusepath{fill}
\begin{pgfscope}
\pgfsetdash{}{0cm}
\pgfsetlinewidth{0.818mm}
\pgfsetroundcap
\pgfsetmiterlimit{4.0}
\pgfpathmoveto{\pgfqpoint{0.682cm}{0.671cm}}
\pgfpathlineto{\pgfqpoint{0.682cm}{0.042cm}}
\pgfusepath{stroke}
\end{pgfscope}
\end{pgfscope}
\end{pgfscope}
\end{pgfscope}
\end{tikzpicture}}}\circ X^{\!\resizebox{0.6em}{!}{
\begin{tikzpicture}
\pgfpathmoveto{\pgfqpoint{0cm}{-0.035cm}}
\pgfpathlineto{\pgfqpoint{1.376cm}{-0.035cm}}
\pgfpathlineto{\pgfqpoint{1.376cm}{1.552cm}}
\pgfpathlineto{\pgfqpoint{0cm}{1.552cm}}
\pgfpathclose
\pgfusepath{clip}
\begin{pgfscope}
\begin{pgfscope}
\pgfpathmoveto{\pgfqpoint{0cm}{-0.035cm}}
\pgfpathlineto{\pgfqpoint{1.376cm}{-0.035cm}}
\pgfpathlineto{\pgfqpoint{1.376cm}{1.552cm}}
\pgfpathlineto{\pgfqpoint{0cm}{1.552cm}}
\pgfpathclose
\pgfusepath{clip}
\begin{pgfscope}
\begin{pgfscope}
\pgfsetdash{}{0cm}
\pgfsetlinewidth{0.818mm}
\pgfsetroundcap
\pgfsetroundjoin
\pgfsetmiterlimit{7.0}
\definecolor{eps2pgf_color}{gray}{0}\pgfsetstrokecolor{eps2pgf_color}\pgfsetfillcolor{eps2pgf_color}
\pgfpathmoveto{\pgfqpoint{0.117cm}{1.421cm}}
\pgfpathlineto{\pgfqpoint{0.682cm}{0.671cm}}
\pgfpathlineto{\pgfqpoint{1.246cm}{1.421cm}}
\pgfusepath{stroke}
\end{pgfscope}
\definecolor{eps2pgf_color}{gray}{0}\pgfsetstrokecolor{eps2pgf_color}\pgfsetfillcolor{eps2pgf_color}
\pgfpathmoveto{\pgfqpoint{0.273cm}{1.395cm}}
\pgfpathcurveto{\pgfqpoint{0.273cm}{1.432cm}}{\pgfqpoint{0.259cm}{1.467cm}}{\pgfqpoint{0.233cm}{1.492cm}}
\pgfpathcurveto{\pgfqpoint{0.207cm}{1.518cm}}{\pgfqpoint{0.173cm}{1.532cm}}{\pgfqpoint{0.137cm}{1.532cm}}
\pgfpathcurveto{\pgfqpoint{0.1cm}{1.532cm}}{\pgfqpoint{0.066cm}{1.518cm}}{\pgfqpoint{0.04cm}{1.492cm}}
\pgfpathcurveto{\pgfqpoint{0.014cm}{1.467cm}}{\pgfqpoint{0cm}{1.432cm}}{\pgfqpoint{0cm}{1.395cm}}
\pgfpathcurveto{\pgfqpoint{0cm}{1.359cm}}{\pgfqpoint{0.014cm}{1.324cm}}{\pgfqpoint{0.04cm}{1.299cm}}
\pgfpathcurveto{\pgfqpoint{0.066cm}{1.273cm}}{\pgfqpoint{0.1cm}{1.258cm}}{\pgfqpoint{0.137cm}{1.258cm}}
\pgfpathcurveto{\pgfqpoint{0.173cm}{1.258cm}}{\pgfqpoint{0.207cm}{1.273cm}}{\pgfqpoint{0.233cm}{1.299cm}}
\pgfpathcurveto{\pgfqpoint{0.259cm}{1.324cm}}{\pgfqpoint{0.273cm}{1.359cm}}{\pgfqpoint{0.273cm}{1.395cm}}
\pgfusepath{fill}
\begin{pgfscope}
\pgfsetdash{}{0cm}
\pgfsetlinewidth{0.818mm}
\pgfsetmiterlimit{7.0}
\pgfpathmoveto{\pgfqpoint{0.682cm}{0.671cm}}
\pgfpathlineto{\pgfqpoint{0.679cm}{1.418cm}}
\pgfusepath{stroke}
\end{pgfscope}
\pgfpathmoveto{\pgfqpoint{0.815cm}{1.399cm}}
\pgfpathcurveto{\pgfqpoint{0.815cm}{1.435cm}}{\pgfqpoint{0.801cm}{1.47cm}}{\pgfqpoint{0.775cm}{1.496cm}}
\pgfpathcurveto{\pgfqpoint{0.75cm}{1.521cm}}{\pgfqpoint{0.715cm}{1.536cm}}{\pgfqpoint{0.679cm}{1.536cm}}
\pgfpathcurveto{\pgfqpoint{0.643cm}{1.536cm}}{\pgfqpoint{0.608cm}{1.521cm}}{\pgfqpoint{0.582cm}{1.496cm}}
\pgfpathcurveto{\pgfqpoint{0.557cm}{1.47cm}}{\pgfqpoint{0.542cm}{1.435cm}}{\pgfqpoint{0.542cm}{1.399cm}}
\pgfpathcurveto{\pgfqpoint{0.542cm}{1.363cm}}{\pgfqpoint{0.557cm}{1.328cm}}{\pgfqpoint{0.582cm}{1.302cm}}
\pgfpathcurveto{\pgfqpoint{0.608cm}{1.276cm}}{\pgfqpoint{0.643cm}{1.262cm}}{\pgfqpoint{0.679cm}{1.262cm}}
\pgfpathcurveto{\pgfqpoint{0.715cm}{1.262cm}}{\pgfqpoint{0.75cm}{1.276cm}}{\pgfqpoint{0.775cm}{1.302cm}}
\pgfpathcurveto{\pgfqpoint{0.801cm}{1.328cm}}{\pgfqpoint{0.815cm}{1.363cm}}{\pgfqpoint{0.815cm}{1.399cm}}
\pgfusepath{fill}
\pgfpathmoveto{\pgfqpoint{1.345cm}{1.371cm}}
\pgfpathcurveto{\pgfqpoint{1.345cm}{1.408cm}}{\pgfqpoint{1.331cm}{1.442cm}}{\pgfqpoint{1.305cm}{1.468cm}}
\pgfpathcurveto{\pgfqpoint{1.28cm}{1.494cm}}{\pgfqpoint{1.245cm}{1.508cm}}{\pgfqpoint{1.209cm}{1.508cm}}
\pgfpathcurveto{\pgfqpoint{1.172cm}{1.508cm}}{\pgfqpoint{1.138cm}{1.494cm}}{\pgfqpoint{1.112cm}{1.468cm}}
\pgfpathcurveto{\pgfqpoint{1.087cm}{1.442cm}}{\pgfqpoint{1.072cm}{1.408cm}}{\pgfqpoint{1.072cm}{1.371cm}}
\pgfpathcurveto{\pgfqpoint{1.072cm}{1.335cm}}{\pgfqpoint{1.087cm}{1.3cm}}{\pgfqpoint{1.112cm}{1.274cm}}
\pgfpathcurveto{\pgfqpoint{1.138cm}{1.249cm}}{\pgfqpoint{1.172cm}{1.234cm}}{\pgfqpoint{1.209cm}{1.234cm}}
\pgfpathcurveto{\pgfqpoint{1.245cm}{1.234cm}}{\pgfqpoint{1.28cm}{1.249cm}}{\pgfqpoint{1.305cm}{1.274cm}}
\pgfpathcurveto{\pgfqpoint{1.331cm}{1.3cm}}{\pgfqpoint{1.345cm}{1.335cm}}{\pgfqpoint{1.345cm}{1.371cm}}
\pgfusepath{fill}
\begin{pgfscope}
\pgfsetdash{}{0cm}
\pgfsetlinewidth{0.818mm}
\pgfsetroundcap
\pgfsetmiterlimit{4.0}
\pgfpathmoveto{\pgfqpoint{0.682cm}{0.671cm}}
\pgfpathlineto{\pgfqpoint{0.682cm}{0.042cm}}
\pgfusepath{stroke}
\end{pgfscope}
\end{pgfscope}
\end{pgfscope}
\end{pgfscope}
\end{tikzpicture}}})=6X^{\!\resizebox{0.6em}{!}{
\begin{tikzpicture}
\pgfpathmoveto{\pgfqpoint{0cm}{-0.035cm}}
\pgfpathlineto{\pgfqpoint{1.376cm}{-0.035cm}}
\pgfpathlineto{\pgfqpoint{1.376cm}{1.552cm}}
\pgfpathlineto{\pgfqpoint{0cm}{1.552cm}}
\pgfpathclose
\pgfusepath{clip}
\begin{pgfscope}
\begin{pgfscope}
\pgfpathmoveto{\pgfqpoint{0cm}{-0.035cm}}
\pgfpathlineto{\pgfqpoint{1.376cm}{-0.035cm}}
\pgfpathlineto{\pgfqpoint{1.376cm}{1.552cm}}
\pgfpathlineto{\pgfqpoint{0cm}{1.552cm}}
\pgfpathclose
\pgfusepath{clip}
\begin{pgfscope}
\begin{pgfscope}
\pgfsetdash{}{0cm}
\pgfsetlinewidth{0.818mm}
\pgfsetroundcap
\pgfsetroundjoin
\pgfsetmiterlimit{7.0}
\definecolor{eps2pgf_color}{gray}{0}\pgfsetstrokecolor{eps2pgf_color}\pgfsetfillcolor{eps2pgf_color}
\pgfpathmoveto{\pgfqpoint{0.117cm}{1.421cm}}
\pgfpathlineto{\pgfqpoint{0.682cm}{0.671cm}}
\pgfpathlineto{\pgfqpoint{1.246cm}{1.421cm}}
\pgfusepath{stroke}
\end{pgfscope}
\definecolor{eps2pgf_color}{gray}{0}\pgfsetstrokecolor{eps2pgf_color}\pgfsetfillcolor{eps2pgf_color}
\pgfpathmoveto{\pgfqpoint{0.273cm}{1.395cm}}
\pgfpathcurveto{\pgfqpoint{0.273cm}{1.432cm}}{\pgfqpoint{0.259cm}{1.467cm}}{\pgfqpoint{0.233cm}{1.492cm}}
\pgfpathcurveto{\pgfqpoint{0.207cm}{1.518cm}}{\pgfqpoint{0.173cm}{1.532cm}}{\pgfqpoint{0.137cm}{1.532cm}}
\pgfpathcurveto{\pgfqpoint{0.1cm}{1.532cm}}{\pgfqpoint{0.066cm}{1.518cm}}{\pgfqpoint{0.04cm}{1.492cm}}
\pgfpathcurveto{\pgfqpoint{0.014cm}{1.467cm}}{\pgfqpoint{0cm}{1.432cm}}{\pgfqpoint{0cm}{1.395cm}}
\pgfpathcurveto{\pgfqpoint{0cm}{1.359cm}}{\pgfqpoint{0.014cm}{1.324cm}}{\pgfqpoint{0.04cm}{1.299cm}}
\pgfpathcurveto{\pgfqpoint{0.066cm}{1.273cm}}{\pgfqpoint{0.1cm}{1.258cm}}{\pgfqpoint{0.137cm}{1.258cm}}
\pgfpathcurveto{\pgfqpoint{0.173cm}{1.258cm}}{\pgfqpoint{0.207cm}{1.273cm}}{\pgfqpoint{0.233cm}{1.299cm}}
\pgfpathcurveto{\pgfqpoint{0.259cm}{1.324cm}}{\pgfqpoint{0.273cm}{1.359cm}}{\pgfqpoint{0.273cm}{1.395cm}}
\pgfusepath{fill}
\begin{pgfscope}
\pgfsetdash{}{0cm}
\pgfsetlinewidth{0.818mm}
\pgfsetmiterlimit{7.0}
\pgfpathmoveto{\pgfqpoint{0.682cm}{0.671cm}}
\pgfpathlineto{\pgfqpoint{0.679cm}{1.418cm}}
\pgfusepath{stroke}
\end{pgfscope}
\pgfpathmoveto{\pgfqpoint{0.815cm}{1.399cm}}
\pgfpathcurveto{\pgfqpoint{0.815cm}{1.435cm}}{\pgfqpoint{0.801cm}{1.47cm}}{\pgfqpoint{0.775cm}{1.496cm}}
\pgfpathcurveto{\pgfqpoint{0.75cm}{1.521cm}}{\pgfqpoint{0.715cm}{1.536cm}}{\pgfqpoint{0.679cm}{1.536cm}}
\pgfpathcurveto{\pgfqpoint{0.643cm}{1.536cm}}{\pgfqpoint{0.608cm}{1.521cm}}{\pgfqpoint{0.582cm}{1.496cm}}
\pgfpathcurveto{\pgfqpoint{0.557cm}{1.47cm}}{\pgfqpoint{0.542cm}{1.435cm}}{\pgfqpoint{0.542cm}{1.399cm}}
\pgfpathcurveto{\pgfqpoint{0.542cm}{1.363cm}}{\pgfqpoint{0.557cm}{1.328cm}}{\pgfqpoint{0.582cm}{1.302cm}}
\pgfpathcurveto{\pgfqpoint{0.608cm}{1.276cm}}{\pgfqpoint{0.643cm}{1.262cm}}{\pgfqpoint{0.679cm}{1.262cm}}
\pgfpathcurveto{\pgfqpoint{0.715cm}{1.262cm}}{\pgfqpoint{0.75cm}{1.276cm}}{\pgfqpoint{0.775cm}{1.302cm}}
\pgfpathcurveto{\pgfqpoint{0.801cm}{1.328cm}}{\pgfqpoint{0.815cm}{1.363cm}}{\pgfqpoint{0.815cm}{1.399cm}}
\pgfusepath{fill}
\pgfpathmoveto{\pgfqpoint{1.345cm}{1.371cm}}
\pgfpathcurveto{\pgfqpoint{1.345cm}{1.408cm}}{\pgfqpoint{1.331cm}{1.442cm}}{\pgfqpoint{1.305cm}{1.468cm}}
\pgfpathcurveto{\pgfqpoint{1.28cm}{1.494cm}}{\pgfqpoint{1.245cm}{1.508cm}}{\pgfqpoint{1.209cm}{1.508cm}}
\pgfpathcurveto{\pgfqpoint{1.172cm}{1.508cm}}{\pgfqpoint{1.138cm}{1.494cm}}{\pgfqpoint{1.112cm}{1.468cm}}
\pgfpathcurveto{\pgfqpoint{1.087cm}{1.442cm}}{\pgfqpoint{1.072cm}{1.408cm}}{\pgfqpoint{1.072cm}{1.371cm}}
\pgfpathcurveto{\pgfqpoint{1.072cm}{1.335cm}}{\pgfqpoint{1.087cm}{1.3cm}}{\pgfqpoint{1.112cm}{1.274cm}}
\pgfpathcurveto{\pgfqpoint{1.138cm}{1.249cm}}{\pgfqpoint{1.172cm}{1.234cm}}{\pgfqpoint{1.209cm}{1.234cm}}
\pgfpathcurveto{\pgfqpoint{1.245cm}{1.234cm}}{\pgfqpoint{1.28cm}{1.249cm}}{\pgfqpoint{1.305cm}{1.274cm}}
\pgfpathcurveto{\pgfqpoint{1.331cm}{1.3cm}}{\pgfqpoint{1.345cm}{1.335cm}}{\pgfqpoint{1.345cm}{1.371cm}}
\pgfusepath{fill}
\begin{pgfscope}
\pgfsetdash{}{0cm}
\pgfsetlinewidth{0.818mm}
\pgfsetroundcap
\pgfsetmiterlimit{4.0}
\pgfpathmoveto{\pgfqpoint{0.682cm}{0.671cm}}
\pgfpathlineto{\pgfqpoint{0.682cm}{0.042cm}}
\pgfusepath{stroke}
\end{pgfscope}
\end{pgfscope}
\end{pgfscope}
\end{pgfscope}
\end{tikzpicture}}}X^{\!\resizebox{!}{.8em}{
\begin{tikzpicture}
\pgfpathmoveto{\pgfqpoint{0cm}{-0.035cm}}
\pgfpathlineto{\pgfqpoint{1.976cm}{-0.035cm}}
\pgfpathlineto{\pgfqpoint{1.976cm}{1.94cm}}
\pgfpathlineto{\pgfqpoint{0cm}{1.94cm}}
\pgfpathclose
\pgfusepath{clip}
\begin{pgfscope}
\begin{pgfscope}
\pgfpathmoveto{\pgfqpoint{0cm}{-0.035cm}}
\pgfpathlineto{\pgfqpoint{1.976cm}{-0.035cm}}
\pgfpathlineto{\pgfqpoint{1.976cm}{1.94cm}}
\pgfpathlineto{\pgfqpoint{0cm}{1.94cm}}
\pgfpathclose
\pgfusepath{clip}
\begin{pgfscope}
\begin{pgfscope}
\pgfsetdash{}{0cm}
\pgfsetlinewidth{0.818mm}
\pgfsetroundcap
\pgfsetroundjoin
\pgfsetmiterlimit{7.0}
\definecolor{eps2pgf_color}{gray}{0}\pgfsetstrokecolor{eps2pgf_color}\pgfsetfillcolor{eps2pgf_color}
\pgfpathmoveto{\pgfqpoint{0.117cm}{1.815cm}}
\pgfpathlineto{\pgfqpoint{0.682cm}{1.065cm}}
\pgfpathlineto{\pgfqpoint{1.246cm}{1.815cm}}
\pgfusepath{stroke}
\end{pgfscope}
\definecolor{eps2pgf_color}{gray}{0}\pgfsetstrokecolor{eps2pgf_color}\pgfsetfillcolor{eps2pgf_color}
\pgfpathmoveto{\pgfqpoint{0.273cm}{1.789cm}}
\pgfpathcurveto{\pgfqpoint{0.273cm}{1.825cm}}{\pgfqpoint{0.259cm}{1.86cm}}{\pgfqpoint{0.233cm}{1.886cm}}
\pgfpathcurveto{\pgfqpoint{0.207cm}{1.912cm}}{\pgfqpoint{0.173cm}{1.926cm}}{\pgfqpoint{0.137cm}{1.926cm}}
\pgfpathcurveto{\pgfqpoint{0.1cm}{1.926cm}}{\pgfqpoint{0.066cm}{1.912cm}}{\pgfqpoint{0.04cm}{1.886cm}}
\pgfpathcurveto{\pgfqpoint{0.014cm}{1.86cm}}{\pgfqpoint{0cm}{1.825cm}}{\pgfqpoint{0cm}{1.789cm}}
\pgfpathcurveto{\pgfqpoint{0cm}{1.753cm}}{\pgfqpoint{0.014cm}{1.718cm}}{\pgfqpoint{0.04cm}{1.692cm}}
\pgfpathcurveto{\pgfqpoint{0.066cm}{1.667cm}}{\pgfqpoint{0.1cm}{1.652cm}}{\pgfqpoint{0.137cm}{1.652cm}}
\pgfpathcurveto{\pgfqpoint{0.173cm}{1.652cm}}{\pgfqpoint{0.207cm}{1.667cm}}{\pgfqpoint{0.233cm}{1.692cm}}
\pgfpathcurveto{\pgfqpoint{0.259cm}{1.718cm}}{\pgfqpoint{0.273cm}{1.753cm}}{\pgfqpoint{0.273cm}{1.789cm}}
\pgfusepath{fill}
\begin{pgfscope}
\pgfsetdash{}{0cm}
\pgfsetlinewidth{0.818mm}
\pgfsetmiterlimit{7.0}
\pgfpathmoveto{\pgfqpoint{0.682cm}{1.065cm}}
\pgfpathlineto{\pgfqpoint{0.679cm}{1.812cm}}
\pgfusepath{stroke}
\end{pgfscope}
\pgfpathmoveto{\pgfqpoint{0.815cm}{1.793cm}}
\pgfpathcurveto{\pgfqpoint{0.815cm}{1.829cm}}{\pgfqpoint{0.801cm}{1.864cm}}{\pgfqpoint{0.775cm}{1.89cm}}
\pgfpathcurveto{\pgfqpoint{0.75cm}{1.915cm}}{\pgfqpoint{0.715cm}{1.93cm}}{\pgfqpoint{0.679cm}{1.93cm}}
\pgfpathcurveto{\pgfqpoint{0.643cm}{1.93cm}}{\pgfqpoint{0.608cm}{1.915cm}}{\pgfqpoint{0.582cm}{1.89cm}}
\pgfpathcurveto{\pgfqpoint{0.557cm}{1.864cm}}{\pgfqpoint{0.542cm}{1.829cm}}{\pgfqpoint{0.542cm}{1.793cm}}
\pgfpathcurveto{\pgfqpoint{0.542cm}{1.756cm}}{\pgfqpoint{0.557cm}{1.722cm}}{\pgfqpoint{0.582cm}{1.696cm}}
\pgfpathcurveto{\pgfqpoint{0.608cm}{1.67cm}}{\pgfqpoint{0.643cm}{1.656cm}}{\pgfqpoint{0.679cm}{1.656cm}}
\pgfpathcurveto{\pgfqpoint{0.715cm}{1.656cm}}{\pgfqpoint{0.75cm}{1.67cm}}{\pgfqpoint{0.775cm}{1.696cm}}
\pgfpathcurveto{\pgfqpoint{0.801cm}{1.722cm}}{\pgfqpoint{0.815cm}{1.756cm}}{\pgfqpoint{0.815cm}{1.793cm}}
\pgfusepath{fill}
\pgfpathmoveto{\pgfqpoint{1.345cm}{1.765cm}}
\pgfpathcurveto{\pgfqpoint{1.345cm}{1.801cm}}{\pgfqpoint{1.331cm}{1.836cm}}{\pgfqpoint{1.305cm}{1.862cm}}
\pgfpathcurveto{\pgfqpoint{1.28cm}{1.887cm}}{\pgfqpoint{1.245cm}{1.902cm}}{\pgfqpoint{1.209cm}{1.902cm}}
\pgfpathcurveto{\pgfqpoint{1.172cm}{1.902cm}}{\pgfqpoint{1.138cm}{1.887cm}}{\pgfqpoint{1.112cm}{1.862cm}}
\pgfpathcurveto{\pgfqpoint{1.087cm}{1.836cm}}{\pgfqpoint{1.072cm}{1.801cm}}{\pgfqpoint{1.072cm}{1.765cm}}
\pgfpathcurveto{\pgfqpoint{1.072cm}{1.728cm}}{\pgfqpoint{1.087cm}{1.694cm}}{\pgfqpoint{1.112cm}{1.668cm}}
\pgfpathcurveto{\pgfqpoint{1.138cm}{1.642cm}}{\pgfqpoint{1.172cm}{1.628cm}}{\pgfqpoint{1.209cm}{1.628cm}}
\pgfpathcurveto{\pgfqpoint{1.245cm}{1.628cm}}{\pgfqpoint{1.28cm}{1.642cm}}{\pgfqpoint{1.305cm}{1.668cm}}
\pgfpathcurveto{\pgfqpoint{1.331cm}{1.694cm}}{\pgfqpoint{1.345cm}{1.728cm}}{\pgfqpoint{1.345cm}{1.765cm}}
\pgfusepath{fill}
\begin{pgfscope}
\pgfsetdash{}{0cm}
\pgfsetlinewidth{0.818mm}
\pgfsetroundcap
\pgfsetroundjoin
\pgfsetmiterlimit{7.0}
\pgfpathmoveto{\pgfqpoint{0.682cm}{1.065cm}}
\pgfpathlineto{\pgfqpoint{1.246cm}{0.315cm}}
\pgfpathlineto{\pgfqpoint{1.811cm}{1.065cm}}
\pgfusepath{stroke}
\end{pgfscope}
\pgfpathmoveto{\pgfqpoint{1.948cm}{1.065cm}}
\pgfpathcurveto{\pgfqpoint{1.948cm}{1.101cm}}{\pgfqpoint{1.933cm}{1.136cm}}{\pgfqpoint{1.907cm}{1.162cm}}
\pgfpathcurveto{\pgfqpoint{1.882cm}{1.187cm}}{\pgfqpoint{1.847cm}{1.202cm}}{\pgfqpoint{1.811cm}{1.202cm}}
\pgfpathcurveto{\pgfqpoint{1.775cm}{1.202cm}}{\pgfqpoint{1.74cm}{1.187cm}}{\pgfqpoint{1.714cm}{1.162cm}}
\pgfpathcurveto{\pgfqpoint{1.689cm}{1.136cm}}{\pgfqpoint{1.674cm}{1.101cm}}{\pgfqpoint{1.674cm}{1.065cm}}
\pgfpathcurveto{\pgfqpoint{1.674cm}{1.029cm}}{\pgfqpoint{1.689cm}{0.994cm}}{\pgfqpoint{1.714cm}{0.968cm}}
\pgfpathcurveto{\pgfqpoint{1.74cm}{0.942cm}}{\pgfqpoint{1.775cm}{0.928cm}}{\pgfqpoint{1.811cm}{0.928cm}}
\pgfpathcurveto{\pgfqpoint{1.847cm}{0.928cm}}{\pgfqpoint{1.882cm}{0.942cm}}{\pgfqpoint{1.907cm}{0.968cm}}
\pgfpathcurveto{\pgfqpoint{1.933cm}{0.994cm}}{\pgfqpoint{1.948cm}{1.029cm}}{\pgfqpoint{1.948cm}{1.065cm}}
\pgfusepath{fill}
\begin{pgfscope}
\pgfsetdash{}{0cm}
\pgfsetlinewidth{0.818mm}
\pgfsetmiterlimit{4.0}
\pgfpathmoveto{\pgfqpoint{1.383cm}{0.178cm}}
\pgfpathcurveto{\pgfqpoint{1.383cm}{0.214cm}}{\pgfqpoint{1.369cm}{0.249cm}}{\pgfqpoint{1.343cm}{0.275cm}}
\pgfpathcurveto{\pgfqpoint{1.317cm}{0.3cm}}{\pgfqpoint{1.283cm}{0.315cm}}{\pgfqpoint{1.246cm}{0.315cm}}
\pgfpathcurveto{\pgfqpoint{1.21cm}{0.315cm}}{\pgfqpoint{1.175cm}{0.3cm}}{\pgfqpoint{1.15cm}{0.275cm}}
\pgfpathcurveto{\pgfqpoint{1.124cm}{0.249cm}}{\pgfqpoint{1.11cm}{0.214cm}}{\pgfqpoint{1.11cm}{0.178cm}}
\pgfpathcurveto{\pgfqpoint{1.11cm}{0.141cm}}{\pgfqpoint{1.124cm}{0.107cm}}{\pgfqpoint{1.15cm}{0.081cm}}
\pgfpathcurveto{\pgfqpoint{1.175cm}{0.055cm}}{\pgfqpoint{1.21cm}{0.041cm}}{\pgfqpoint{1.246cm}{0.041cm}}
\pgfpathcurveto{\pgfqpoint{1.283cm}{0.041cm}}{\pgfqpoint{1.317cm}{0.055cm}}{\pgfqpoint{1.343cm}{0.081cm}}
\pgfpathcurveto{\pgfqpoint{1.369cm}{0.107cm}}{\pgfqpoint{1.383cm}{0.141cm}}{\pgfqpoint{1.383cm}{0.178cm}}
\pgfusepath{stroke}
\end{pgfscope}
\end{pgfscope}
\end{pgfscope}
\end{pgfscope}
\end{tikzpicture}}}+6\mathrm{com}(X^{\!\resizebox{0.6em}{!}{
\begin{tikzpicture}
\pgfpathmoveto{\pgfqpoint{0cm}{-0.035cm}}
\pgfpathlineto{\pgfqpoint{1.376cm}{-0.035cm}}
\pgfpathlineto{\pgfqpoint{1.376cm}{1.552cm}}
\pgfpathlineto{\pgfqpoint{0cm}{1.552cm}}
\pgfpathclose
\pgfusepath{clip}
\begin{pgfscope}
\begin{pgfscope}
\pgfpathmoveto{\pgfqpoint{0cm}{-0.035cm}}
\pgfpathlineto{\pgfqpoint{1.376cm}{-0.035cm}}
\pgfpathlineto{\pgfqpoint{1.376cm}{1.552cm}}
\pgfpathlineto{\pgfqpoint{0cm}{1.552cm}}
\pgfpathclose
\pgfusepath{clip}
\begin{pgfscope}
\begin{pgfscope}
\pgfsetdash{}{0cm}
\pgfsetlinewidth{0.818mm}
\pgfsetroundcap
\pgfsetroundjoin
\pgfsetmiterlimit{7.0}
\definecolor{eps2pgf_color}{gray}{0}\pgfsetstrokecolor{eps2pgf_color}\pgfsetfillcolor{eps2pgf_color}
\pgfpathmoveto{\pgfqpoint{0.117cm}{1.421cm}}
\pgfpathlineto{\pgfqpoint{0.682cm}{0.671cm}}
\pgfpathlineto{\pgfqpoint{1.246cm}{1.421cm}}
\pgfusepath{stroke}
\end{pgfscope}
\definecolor{eps2pgf_color}{gray}{0}\pgfsetstrokecolor{eps2pgf_color}\pgfsetfillcolor{eps2pgf_color}
\pgfpathmoveto{\pgfqpoint{0.273cm}{1.395cm}}
\pgfpathcurveto{\pgfqpoint{0.273cm}{1.432cm}}{\pgfqpoint{0.259cm}{1.467cm}}{\pgfqpoint{0.233cm}{1.492cm}}
\pgfpathcurveto{\pgfqpoint{0.207cm}{1.518cm}}{\pgfqpoint{0.173cm}{1.532cm}}{\pgfqpoint{0.137cm}{1.532cm}}
\pgfpathcurveto{\pgfqpoint{0.1cm}{1.532cm}}{\pgfqpoint{0.066cm}{1.518cm}}{\pgfqpoint{0.04cm}{1.492cm}}
\pgfpathcurveto{\pgfqpoint{0.014cm}{1.467cm}}{\pgfqpoint{0cm}{1.432cm}}{\pgfqpoint{0cm}{1.395cm}}
\pgfpathcurveto{\pgfqpoint{0cm}{1.359cm}}{\pgfqpoint{0.014cm}{1.324cm}}{\pgfqpoint{0.04cm}{1.299cm}}
\pgfpathcurveto{\pgfqpoint{0.066cm}{1.273cm}}{\pgfqpoint{0.1cm}{1.258cm}}{\pgfqpoint{0.137cm}{1.258cm}}
\pgfpathcurveto{\pgfqpoint{0.173cm}{1.258cm}}{\pgfqpoint{0.207cm}{1.273cm}}{\pgfqpoint{0.233cm}{1.299cm}}
\pgfpathcurveto{\pgfqpoint{0.259cm}{1.324cm}}{\pgfqpoint{0.273cm}{1.359cm}}{\pgfqpoint{0.273cm}{1.395cm}}
\pgfusepath{fill}
\begin{pgfscope}
\pgfsetdash{}{0cm}
\pgfsetlinewidth{0.818mm}
\pgfsetmiterlimit{7.0}
\pgfpathmoveto{\pgfqpoint{0.682cm}{0.671cm}}
\pgfpathlineto{\pgfqpoint{0.679cm}{1.418cm}}
\pgfusepath{stroke}
\end{pgfscope}
\pgfpathmoveto{\pgfqpoint{0.815cm}{1.399cm}}
\pgfpathcurveto{\pgfqpoint{0.815cm}{1.435cm}}{\pgfqpoint{0.801cm}{1.47cm}}{\pgfqpoint{0.775cm}{1.496cm}}
\pgfpathcurveto{\pgfqpoint{0.75cm}{1.521cm}}{\pgfqpoint{0.715cm}{1.536cm}}{\pgfqpoint{0.679cm}{1.536cm}}
\pgfpathcurveto{\pgfqpoint{0.643cm}{1.536cm}}{\pgfqpoint{0.608cm}{1.521cm}}{\pgfqpoint{0.582cm}{1.496cm}}
\pgfpathcurveto{\pgfqpoint{0.557cm}{1.47cm}}{\pgfqpoint{0.542cm}{1.435cm}}{\pgfqpoint{0.542cm}{1.399cm}}
\pgfpathcurveto{\pgfqpoint{0.542cm}{1.363cm}}{\pgfqpoint{0.557cm}{1.328cm}}{\pgfqpoint{0.582cm}{1.302cm}}
\pgfpathcurveto{\pgfqpoint{0.608cm}{1.276cm}}{\pgfqpoint{0.643cm}{1.262cm}}{\pgfqpoint{0.679cm}{1.262cm}}
\pgfpathcurveto{\pgfqpoint{0.715cm}{1.262cm}}{\pgfqpoint{0.75cm}{1.276cm}}{\pgfqpoint{0.775cm}{1.302cm}}
\pgfpathcurveto{\pgfqpoint{0.801cm}{1.328cm}}{\pgfqpoint{0.815cm}{1.363cm}}{\pgfqpoint{0.815cm}{1.399cm}}
\pgfusepath{fill}
\pgfpathmoveto{\pgfqpoint{1.345cm}{1.371cm}}
\pgfpathcurveto{\pgfqpoint{1.345cm}{1.408cm}}{\pgfqpoint{1.331cm}{1.442cm}}{\pgfqpoint{1.305cm}{1.468cm}}
\pgfpathcurveto{\pgfqpoint{1.28cm}{1.494cm}}{\pgfqpoint{1.245cm}{1.508cm}}{\pgfqpoint{1.209cm}{1.508cm}}
\pgfpathcurveto{\pgfqpoint{1.172cm}{1.508cm}}{\pgfqpoint{1.138cm}{1.494cm}}{\pgfqpoint{1.112cm}{1.468cm}}
\pgfpathcurveto{\pgfqpoint{1.087cm}{1.442cm}}{\pgfqpoint{1.072cm}{1.408cm}}{\pgfqpoint{1.072cm}{1.371cm}}
\pgfpathcurveto{\pgfqpoint{1.072cm}{1.335cm}}{\pgfqpoint{1.087cm}{1.3cm}}{\pgfqpoint{1.112cm}{1.274cm}}
\pgfpathcurveto{\pgfqpoint{1.138cm}{1.249cm}}{\pgfqpoint{1.172cm}{1.234cm}}{\pgfqpoint{1.209cm}{1.234cm}}
\pgfpathcurveto{\pgfqpoint{1.245cm}{1.234cm}}{\pgfqpoint{1.28cm}{1.249cm}}{\pgfqpoint{1.305cm}{1.274cm}}
\pgfpathcurveto{\pgfqpoint{1.331cm}{1.3cm}}{\pgfqpoint{1.345cm}{1.335cm}}{\pgfqpoint{1.345cm}{1.371cm}}
\pgfusepath{fill}
\begin{pgfscope}
\pgfsetdash{}{0cm}
\pgfsetlinewidth{0.818mm}
\pgfsetroundcap
\pgfsetmiterlimit{4.0}
\pgfpathmoveto{\pgfqpoint{0.682cm}{0.671cm}}
\pgfpathlineto{\pgfqpoint{0.682cm}{0.042cm}}
\pgfusepath{stroke}
\end{pgfscope}
\end{pgfscope}
\end{pgfscope}
\end{pgfscope}
\end{tikzpicture}}},X^{\!\resizebox{0.6em}{!}{
\begin{tikzpicture}
\pgfpathmoveto{\pgfqpoint{0cm}{-0.035cm}}
\pgfpathlineto{\pgfqpoint{1.376cm}{-0.035cm}}
\pgfpathlineto{\pgfqpoint{1.376cm}{1.552cm}}
\pgfpathlineto{\pgfqpoint{0cm}{1.552cm}}
\pgfpathclose
\pgfusepath{clip}
\begin{pgfscope}
\begin{pgfscope}
\pgfpathmoveto{\pgfqpoint{0cm}{-0.035cm}}
\pgfpathlineto{\pgfqpoint{1.376cm}{-0.035cm}}
\pgfpathlineto{\pgfqpoint{1.376cm}{1.552cm}}
\pgfpathlineto{\pgfqpoint{0cm}{1.552cm}}
\pgfpathclose
\pgfusepath{clip}
\begin{pgfscope}
\begin{pgfscope}
\pgfsetdash{}{0cm}
\pgfsetlinewidth{0.818mm}
\pgfsetroundcap
\pgfsetroundjoin
\pgfsetmiterlimit{7.0}
\definecolor{eps2pgf_color}{gray}{0}\pgfsetstrokecolor{eps2pgf_color}\pgfsetfillcolor{eps2pgf_color}
\pgfpathmoveto{\pgfqpoint{0.117cm}{1.421cm}}
\pgfpathlineto{\pgfqpoint{0.682cm}{0.671cm}}
\pgfpathlineto{\pgfqpoint{1.246cm}{1.421cm}}
\pgfusepath{stroke}
\end{pgfscope}
\definecolor{eps2pgf_color}{gray}{0}\pgfsetstrokecolor{eps2pgf_color}\pgfsetfillcolor{eps2pgf_color}
\pgfpathmoveto{\pgfqpoint{0.273cm}{1.395cm}}
\pgfpathcurveto{\pgfqpoint{0.273cm}{1.432cm}}{\pgfqpoint{0.259cm}{1.467cm}}{\pgfqpoint{0.233cm}{1.492cm}}
\pgfpathcurveto{\pgfqpoint{0.207cm}{1.518cm}}{\pgfqpoint{0.173cm}{1.532cm}}{\pgfqpoint{0.137cm}{1.532cm}}
\pgfpathcurveto{\pgfqpoint{0.1cm}{1.532cm}}{\pgfqpoint{0.066cm}{1.518cm}}{\pgfqpoint{0.04cm}{1.492cm}}
\pgfpathcurveto{\pgfqpoint{0.014cm}{1.467cm}}{\pgfqpoint{0cm}{1.432cm}}{\pgfqpoint{0cm}{1.395cm}}
\pgfpathcurveto{\pgfqpoint{0cm}{1.359cm}}{\pgfqpoint{0.014cm}{1.324cm}}{\pgfqpoint{0.04cm}{1.299cm}}
\pgfpathcurveto{\pgfqpoint{0.066cm}{1.273cm}}{\pgfqpoint{0.1cm}{1.258cm}}{\pgfqpoint{0.137cm}{1.258cm}}
\pgfpathcurveto{\pgfqpoint{0.173cm}{1.258cm}}{\pgfqpoint{0.207cm}{1.273cm}}{\pgfqpoint{0.233cm}{1.299cm}}
\pgfpathcurveto{\pgfqpoint{0.259cm}{1.324cm}}{\pgfqpoint{0.273cm}{1.359cm}}{\pgfqpoint{0.273cm}{1.395cm}}
\pgfusepath{fill}
\begin{pgfscope}
\pgfsetdash{}{0cm}
\pgfsetlinewidth{0.818mm}
\pgfsetmiterlimit{7.0}
\pgfpathmoveto{\pgfqpoint{0.682cm}{0.671cm}}
\pgfpathlineto{\pgfqpoint{0.679cm}{1.418cm}}
\pgfusepath{stroke}
\end{pgfscope}
\pgfpathmoveto{\pgfqpoint{0.815cm}{1.399cm}}
\pgfpathcurveto{\pgfqpoint{0.815cm}{1.435cm}}{\pgfqpoint{0.801cm}{1.47cm}}{\pgfqpoint{0.775cm}{1.496cm}}
\pgfpathcurveto{\pgfqpoint{0.75cm}{1.521cm}}{\pgfqpoint{0.715cm}{1.536cm}}{\pgfqpoint{0.679cm}{1.536cm}}
\pgfpathcurveto{\pgfqpoint{0.643cm}{1.536cm}}{\pgfqpoint{0.608cm}{1.521cm}}{\pgfqpoint{0.582cm}{1.496cm}}
\pgfpathcurveto{\pgfqpoint{0.557cm}{1.47cm}}{\pgfqpoint{0.542cm}{1.435cm}}{\pgfqpoint{0.542cm}{1.399cm}}
\pgfpathcurveto{\pgfqpoint{0.542cm}{1.363cm}}{\pgfqpoint{0.557cm}{1.328cm}}{\pgfqpoint{0.582cm}{1.302cm}}
\pgfpathcurveto{\pgfqpoint{0.608cm}{1.276cm}}{\pgfqpoint{0.643cm}{1.262cm}}{\pgfqpoint{0.679cm}{1.262cm}}
\pgfpathcurveto{\pgfqpoint{0.715cm}{1.262cm}}{\pgfqpoint{0.75cm}{1.276cm}}{\pgfqpoint{0.775cm}{1.302cm}}
\pgfpathcurveto{\pgfqpoint{0.801cm}{1.328cm}}{\pgfqpoint{0.815cm}{1.363cm}}{\pgfqpoint{0.815cm}{1.399cm}}
\pgfusepath{fill}
\pgfpathmoveto{\pgfqpoint{1.345cm}{1.371cm}}
\pgfpathcurveto{\pgfqpoint{1.345cm}{1.408cm}}{\pgfqpoint{1.331cm}{1.442cm}}{\pgfqpoint{1.305cm}{1.468cm}}
\pgfpathcurveto{\pgfqpoint{1.28cm}{1.494cm}}{\pgfqpoint{1.245cm}{1.508cm}}{\pgfqpoint{1.209cm}{1.508cm}}
\pgfpathcurveto{\pgfqpoint{1.172cm}{1.508cm}}{\pgfqpoint{1.138cm}{1.494cm}}{\pgfqpoint{1.112cm}{1.468cm}}
\pgfpathcurveto{\pgfqpoint{1.087cm}{1.442cm}}{\pgfqpoint{1.072cm}{1.408cm}}{\pgfqpoint{1.072cm}{1.371cm}}
\pgfpathcurveto{\pgfqpoint{1.072cm}{1.335cm}}{\pgfqpoint{1.087cm}{1.3cm}}{\pgfqpoint{1.112cm}{1.274cm}}
\pgfpathcurveto{\pgfqpoint{1.138cm}{1.249cm}}{\pgfqpoint{1.172cm}{1.234cm}}{\pgfqpoint{1.209cm}{1.234cm}}
\pgfpathcurveto{\pgfqpoint{1.245cm}{1.234cm}}{\pgfqpoint{1.28cm}{1.249cm}}{\pgfqpoint{1.305cm}{1.274cm}}
\pgfpathcurveto{\pgfqpoint{1.331cm}{1.3cm}}{\pgfqpoint{1.345cm}{1.335cm}}{\pgfqpoint{1.345cm}{1.371cm}}
\pgfusepath{fill}
\begin{pgfscope}
\pgfsetdash{}{0cm}
\pgfsetlinewidth{0.818mm}
\pgfsetroundcap
\pgfsetmiterlimit{4.0}
\pgfpathmoveto{\pgfqpoint{0.682cm}{0.671cm}}
\pgfpathlineto{\pgfqpoint{0.682cm}{0.042cm}}
\pgfusepath{stroke}
\end{pgfscope}
\end{pgfscope}
\end{pgfscope}
\end{pgfscope}
\end{tikzpicture}}},X)+3X\circ (X^{\!\resizebox{0.6em}{!}{
\begin{tikzpicture}
\pgfpathmoveto{\pgfqpoint{0cm}{-0.035cm}}
\pgfpathlineto{\pgfqpoint{1.376cm}{-0.035cm}}
\pgfpathlineto{\pgfqpoint{1.376cm}{1.552cm}}
\pgfpathlineto{\pgfqpoint{0cm}{1.552cm}}
\pgfpathclose
\pgfusepath{clip}
\begin{pgfscope}
\begin{pgfscope}
\pgfpathmoveto{\pgfqpoint{0cm}{-0.035cm}}
\pgfpathlineto{\pgfqpoint{1.376cm}{-0.035cm}}
\pgfpathlineto{\pgfqpoint{1.376cm}{1.552cm}}
\pgfpathlineto{\pgfqpoint{0cm}{1.552cm}}
\pgfpathclose
\pgfusepath{clip}
\begin{pgfscope}
\begin{pgfscope}
\pgfsetdash{}{0cm}
\pgfsetlinewidth{0.818mm}
\pgfsetroundcap
\pgfsetroundjoin
\pgfsetmiterlimit{7.0}
\definecolor{eps2pgf_color}{gray}{0}\pgfsetstrokecolor{eps2pgf_color}\pgfsetfillcolor{eps2pgf_color}
\pgfpathmoveto{\pgfqpoint{0.117cm}{1.421cm}}
\pgfpathlineto{\pgfqpoint{0.682cm}{0.671cm}}
\pgfpathlineto{\pgfqpoint{1.246cm}{1.421cm}}
\pgfusepath{stroke}
\end{pgfscope}
\definecolor{eps2pgf_color}{gray}{0}\pgfsetstrokecolor{eps2pgf_color}\pgfsetfillcolor{eps2pgf_color}
\pgfpathmoveto{\pgfqpoint{0.273cm}{1.395cm}}
\pgfpathcurveto{\pgfqpoint{0.273cm}{1.432cm}}{\pgfqpoint{0.259cm}{1.467cm}}{\pgfqpoint{0.233cm}{1.492cm}}
\pgfpathcurveto{\pgfqpoint{0.207cm}{1.518cm}}{\pgfqpoint{0.173cm}{1.532cm}}{\pgfqpoint{0.137cm}{1.532cm}}
\pgfpathcurveto{\pgfqpoint{0.1cm}{1.532cm}}{\pgfqpoint{0.066cm}{1.518cm}}{\pgfqpoint{0.04cm}{1.492cm}}
\pgfpathcurveto{\pgfqpoint{0.014cm}{1.467cm}}{\pgfqpoint{0cm}{1.432cm}}{\pgfqpoint{0cm}{1.395cm}}
\pgfpathcurveto{\pgfqpoint{0cm}{1.359cm}}{\pgfqpoint{0.014cm}{1.324cm}}{\pgfqpoint{0.04cm}{1.299cm}}
\pgfpathcurveto{\pgfqpoint{0.066cm}{1.273cm}}{\pgfqpoint{0.1cm}{1.258cm}}{\pgfqpoint{0.137cm}{1.258cm}}
\pgfpathcurveto{\pgfqpoint{0.173cm}{1.258cm}}{\pgfqpoint{0.207cm}{1.273cm}}{\pgfqpoint{0.233cm}{1.299cm}}
\pgfpathcurveto{\pgfqpoint{0.259cm}{1.324cm}}{\pgfqpoint{0.273cm}{1.359cm}}{\pgfqpoint{0.273cm}{1.395cm}}
\pgfusepath{fill}
\begin{pgfscope}
\pgfsetdash{}{0cm}
\pgfsetlinewidth{0.818mm}
\pgfsetmiterlimit{7.0}
\pgfpathmoveto{\pgfqpoint{0.682cm}{0.671cm}}
\pgfpathlineto{\pgfqpoint{0.679cm}{1.418cm}}
\pgfusepath{stroke}
\end{pgfscope}
\pgfpathmoveto{\pgfqpoint{0.815cm}{1.399cm}}
\pgfpathcurveto{\pgfqpoint{0.815cm}{1.435cm}}{\pgfqpoint{0.801cm}{1.47cm}}{\pgfqpoint{0.775cm}{1.496cm}}
\pgfpathcurveto{\pgfqpoint{0.75cm}{1.521cm}}{\pgfqpoint{0.715cm}{1.536cm}}{\pgfqpoint{0.679cm}{1.536cm}}
\pgfpathcurveto{\pgfqpoint{0.643cm}{1.536cm}}{\pgfqpoint{0.608cm}{1.521cm}}{\pgfqpoint{0.582cm}{1.496cm}}
\pgfpathcurveto{\pgfqpoint{0.557cm}{1.47cm}}{\pgfqpoint{0.542cm}{1.435cm}}{\pgfqpoint{0.542cm}{1.399cm}}
\pgfpathcurveto{\pgfqpoint{0.542cm}{1.363cm}}{\pgfqpoint{0.557cm}{1.328cm}}{\pgfqpoint{0.582cm}{1.302cm}}
\pgfpathcurveto{\pgfqpoint{0.608cm}{1.276cm}}{\pgfqpoint{0.643cm}{1.262cm}}{\pgfqpoint{0.679cm}{1.262cm}}
\pgfpathcurveto{\pgfqpoint{0.715cm}{1.262cm}}{\pgfqpoint{0.75cm}{1.276cm}}{\pgfqpoint{0.775cm}{1.302cm}}
\pgfpathcurveto{\pgfqpoint{0.801cm}{1.328cm}}{\pgfqpoint{0.815cm}{1.363cm}}{\pgfqpoint{0.815cm}{1.399cm}}
\pgfusepath{fill}
\pgfpathmoveto{\pgfqpoint{1.345cm}{1.371cm}}
\pgfpathcurveto{\pgfqpoint{1.345cm}{1.408cm}}{\pgfqpoint{1.331cm}{1.442cm}}{\pgfqpoint{1.305cm}{1.468cm}}
\pgfpathcurveto{\pgfqpoint{1.28cm}{1.494cm}}{\pgfqpoint{1.245cm}{1.508cm}}{\pgfqpoint{1.209cm}{1.508cm}}
\pgfpathcurveto{\pgfqpoint{1.172cm}{1.508cm}}{\pgfqpoint{1.138cm}{1.494cm}}{\pgfqpoint{1.112cm}{1.468cm}}
\pgfpathcurveto{\pgfqpoint{1.087cm}{1.442cm}}{\pgfqpoint{1.072cm}{1.408cm}}{\pgfqpoint{1.072cm}{1.371cm}}
\pgfpathcurveto{\pgfqpoint{1.072cm}{1.335cm}}{\pgfqpoint{1.087cm}{1.3cm}}{\pgfqpoint{1.112cm}{1.274cm}}
\pgfpathcurveto{\pgfqpoint{1.138cm}{1.249cm}}{\pgfqpoint{1.172cm}{1.234cm}}{\pgfqpoint{1.209cm}{1.234cm}}
\pgfpathcurveto{\pgfqpoint{1.245cm}{1.234cm}}{\pgfqpoint{1.28cm}{1.249cm}}{\pgfqpoint{1.305cm}{1.274cm}}
\pgfpathcurveto{\pgfqpoint{1.331cm}{1.3cm}}{\pgfqpoint{1.345cm}{1.335cm}}{\pgfqpoint{1.345cm}{1.371cm}}
\pgfusepath{fill}
\begin{pgfscope}
\pgfsetdash{}{0cm}
\pgfsetlinewidth{0.818mm}
\pgfsetroundcap
\pgfsetmiterlimit{4.0}
\pgfpathmoveto{\pgfqpoint{0.682cm}{0.671cm}}
\pgfpathlineto{\pgfqpoint{0.682cm}{0.042cm}}
\pgfusepath{stroke}
\end{pgfscope}
\end{pgfscope}
\end{pgfscope}
\end{pgfscope}
\end{tikzpicture}}}\circ X^{\!\resizebox{0.6em}{!}{
\begin{tikzpicture}
\pgfpathmoveto{\pgfqpoint{0cm}{-0.035cm}}
\pgfpathlineto{\pgfqpoint{1.376cm}{-0.035cm}}
\pgfpathlineto{\pgfqpoint{1.376cm}{1.552cm}}
\pgfpathlineto{\pgfqpoint{0cm}{1.552cm}}
\pgfpathclose
\pgfusepath{clip}
\begin{pgfscope}
\begin{pgfscope}
\pgfpathmoveto{\pgfqpoint{0cm}{-0.035cm}}
\pgfpathlineto{\pgfqpoint{1.376cm}{-0.035cm}}
\pgfpathlineto{\pgfqpoint{1.376cm}{1.552cm}}
\pgfpathlineto{\pgfqpoint{0cm}{1.552cm}}
\pgfpathclose
\pgfusepath{clip}
\begin{pgfscope}
\begin{pgfscope}
\pgfsetdash{}{0cm}
\pgfsetlinewidth{0.818mm}
\pgfsetroundcap
\pgfsetroundjoin
\pgfsetmiterlimit{7.0}
\definecolor{eps2pgf_color}{gray}{0}\pgfsetstrokecolor{eps2pgf_color}\pgfsetfillcolor{eps2pgf_color}
\pgfpathmoveto{\pgfqpoint{0.117cm}{1.421cm}}
\pgfpathlineto{\pgfqpoint{0.682cm}{0.671cm}}
\pgfpathlineto{\pgfqpoint{1.246cm}{1.421cm}}
\pgfusepath{stroke}
\end{pgfscope}
\definecolor{eps2pgf_color}{gray}{0}\pgfsetstrokecolor{eps2pgf_color}\pgfsetfillcolor{eps2pgf_color}
\pgfpathmoveto{\pgfqpoint{0.273cm}{1.395cm}}
\pgfpathcurveto{\pgfqpoint{0.273cm}{1.432cm}}{\pgfqpoint{0.259cm}{1.467cm}}{\pgfqpoint{0.233cm}{1.492cm}}
\pgfpathcurveto{\pgfqpoint{0.207cm}{1.518cm}}{\pgfqpoint{0.173cm}{1.532cm}}{\pgfqpoint{0.137cm}{1.532cm}}
\pgfpathcurveto{\pgfqpoint{0.1cm}{1.532cm}}{\pgfqpoint{0.066cm}{1.518cm}}{\pgfqpoint{0.04cm}{1.492cm}}
\pgfpathcurveto{\pgfqpoint{0.014cm}{1.467cm}}{\pgfqpoint{0cm}{1.432cm}}{\pgfqpoint{0cm}{1.395cm}}
\pgfpathcurveto{\pgfqpoint{0cm}{1.359cm}}{\pgfqpoint{0.014cm}{1.324cm}}{\pgfqpoint{0.04cm}{1.299cm}}
\pgfpathcurveto{\pgfqpoint{0.066cm}{1.273cm}}{\pgfqpoint{0.1cm}{1.258cm}}{\pgfqpoint{0.137cm}{1.258cm}}
\pgfpathcurveto{\pgfqpoint{0.173cm}{1.258cm}}{\pgfqpoint{0.207cm}{1.273cm}}{\pgfqpoint{0.233cm}{1.299cm}}
\pgfpathcurveto{\pgfqpoint{0.259cm}{1.324cm}}{\pgfqpoint{0.273cm}{1.359cm}}{\pgfqpoint{0.273cm}{1.395cm}}
\pgfusepath{fill}
\begin{pgfscope}
\pgfsetdash{}{0cm}
\pgfsetlinewidth{0.818mm}
\pgfsetmiterlimit{7.0}
\pgfpathmoveto{\pgfqpoint{0.682cm}{0.671cm}}
\pgfpathlineto{\pgfqpoint{0.679cm}{1.418cm}}
\pgfusepath{stroke}
\end{pgfscope}
\pgfpathmoveto{\pgfqpoint{0.815cm}{1.399cm}}
\pgfpathcurveto{\pgfqpoint{0.815cm}{1.435cm}}{\pgfqpoint{0.801cm}{1.47cm}}{\pgfqpoint{0.775cm}{1.496cm}}
\pgfpathcurveto{\pgfqpoint{0.75cm}{1.521cm}}{\pgfqpoint{0.715cm}{1.536cm}}{\pgfqpoint{0.679cm}{1.536cm}}
\pgfpathcurveto{\pgfqpoint{0.643cm}{1.536cm}}{\pgfqpoint{0.608cm}{1.521cm}}{\pgfqpoint{0.582cm}{1.496cm}}
\pgfpathcurveto{\pgfqpoint{0.557cm}{1.47cm}}{\pgfqpoint{0.542cm}{1.435cm}}{\pgfqpoint{0.542cm}{1.399cm}}
\pgfpathcurveto{\pgfqpoint{0.542cm}{1.363cm}}{\pgfqpoint{0.557cm}{1.328cm}}{\pgfqpoint{0.582cm}{1.302cm}}
\pgfpathcurveto{\pgfqpoint{0.608cm}{1.276cm}}{\pgfqpoint{0.643cm}{1.262cm}}{\pgfqpoint{0.679cm}{1.262cm}}
\pgfpathcurveto{\pgfqpoint{0.715cm}{1.262cm}}{\pgfqpoint{0.75cm}{1.276cm}}{\pgfqpoint{0.775cm}{1.302cm}}
\pgfpathcurveto{\pgfqpoint{0.801cm}{1.328cm}}{\pgfqpoint{0.815cm}{1.363cm}}{\pgfqpoint{0.815cm}{1.399cm}}
\pgfusepath{fill}
\pgfpathmoveto{\pgfqpoint{1.345cm}{1.371cm}}
\pgfpathcurveto{\pgfqpoint{1.345cm}{1.408cm}}{\pgfqpoint{1.331cm}{1.442cm}}{\pgfqpoint{1.305cm}{1.468cm}}
\pgfpathcurveto{\pgfqpoint{1.28cm}{1.494cm}}{\pgfqpoint{1.245cm}{1.508cm}}{\pgfqpoint{1.209cm}{1.508cm}}
\pgfpathcurveto{\pgfqpoint{1.172cm}{1.508cm}}{\pgfqpoint{1.138cm}{1.494cm}}{\pgfqpoint{1.112cm}{1.468cm}}
\pgfpathcurveto{\pgfqpoint{1.087cm}{1.442cm}}{\pgfqpoint{1.072cm}{1.408cm}}{\pgfqpoint{1.072cm}{1.371cm}}
\pgfpathcurveto{\pgfqpoint{1.072cm}{1.335cm}}{\pgfqpoint{1.087cm}{1.3cm}}{\pgfqpoint{1.112cm}{1.274cm}}
\pgfpathcurveto{\pgfqpoint{1.138cm}{1.249cm}}{\pgfqpoint{1.172cm}{1.234cm}}{\pgfqpoint{1.209cm}{1.234cm}}
\pgfpathcurveto{\pgfqpoint{1.245cm}{1.234cm}}{\pgfqpoint{1.28cm}{1.249cm}}{\pgfqpoint{1.305cm}{1.274cm}}
\pgfpathcurveto{\pgfqpoint{1.331cm}{1.3cm}}{\pgfqpoint{1.345cm}{1.335cm}}{\pgfqpoint{1.345cm}{1.371cm}}
\pgfusepath{fill}
\begin{pgfscope}
\pgfsetdash{}{0cm}
\pgfsetlinewidth{0.818mm}
\pgfsetroundcap
\pgfsetmiterlimit{4.0}
\pgfpathmoveto{\pgfqpoint{0.682cm}{0.671cm}}
\pgfpathlineto{\pgfqpoint{0.682cm}{0.042cm}}
\pgfusepath{stroke}
\end{pgfscope}
\end{pgfscope}
\end{pgfscope}
\end{pgfscope}
\end{tikzpicture}}}).
$$
Similarly we decompose
$$
6X\circ(X^{\!\resizebox{0.6em}{!}{
\begin{tikzpicture}
\pgfpathmoveto{\pgfqpoint{0cm}{-0.035cm}}
\pgfpathlineto{\pgfqpoint{1.376cm}{-0.035cm}}
\pgfpathlineto{\pgfqpoint{1.376cm}{1.552cm}}
\pgfpathlineto{\pgfqpoint{0cm}{1.552cm}}
\pgfpathclose
\pgfusepath{clip}
\begin{pgfscope}
\begin{pgfscope}
\pgfpathmoveto{\pgfqpoint{0cm}{-0.035cm}}
\pgfpathlineto{\pgfqpoint{1.376cm}{-0.035cm}}
\pgfpathlineto{\pgfqpoint{1.376cm}{1.552cm}}
\pgfpathlineto{\pgfqpoint{0cm}{1.552cm}}
\pgfpathclose
\pgfusepath{clip}
\begin{pgfscope}
\begin{pgfscope}
\pgfsetdash{}{0cm}
\pgfsetlinewidth{0.818mm}
\pgfsetroundcap
\pgfsetroundjoin
\pgfsetmiterlimit{7.0}
\definecolor{eps2pgf_color}{gray}{0}\pgfsetstrokecolor{eps2pgf_color}\pgfsetfillcolor{eps2pgf_color}
\pgfpathmoveto{\pgfqpoint{0.117cm}{1.421cm}}
\pgfpathlineto{\pgfqpoint{0.682cm}{0.671cm}}
\pgfpathlineto{\pgfqpoint{1.246cm}{1.421cm}}
\pgfusepath{stroke}
\end{pgfscope}
\definecolor{eps2pgf_color}{gray}{0}\pgfsetstrokecolor{eps2pgf_color}\pgfsetfillcolor{eps2pgf_color}
\pgfpathmoveto{\pgfqpoint{0.273cm}{1.395cm}}
\pgfpathcurveto{\pgfqpoint{0.273cm}{1.432cm}}{\pgfqpoint{0.259cm}{1.467cm}}{\pgfqpoint{0.233cm}{1.492cm}}
\pgfpathcurveto{\pgfqpoint{0.207cm}{1.518cm}}{\pgfqpoint{0.173cm}{1.532cm}}{\pgfqpoint{0.137cm}{1.532cm}}
\pgfpathcurveto{\pgfqpoint{0.1cm}{1.532cm}}{\pgfqpoint{0.066cm}{1.518cm}}{\pgfqpoint{0.04cm}{1.492cm}}
\pgfpathcurveto{\pgfqpoint{0.014cm}{1.467cm}}{\pgfqpoint{0cm}{1.432cm}}{\pgfqpoint{0cm}{1.395cm}}
\pgfpathcurveto{\pgfqpoint{0cm}{1.359cm}}{\pgfqpoint{0.014cm}{1.324cm}}{\pgfqpoint{0.04cm}{1.299cm}}
\pgfpathcurveto{\pgfqpoint{0.066cm}{1.273cm}}{\pgfqpoint{0.1cm}{1.258cm}}{\pgfqpoint{0.137cm}{1.258cm}}
\pgfpathcurveto{\pgfqpoint{0.173cm}{1.258cm}}{\pgfqpoint{0.207cm}{1.273cm}}{\pgfqpoint{0.233cm}{1.299cm}}
\pgfpathcurveto{\pgfqpoint{0.259cm}{1.324cm}}{\pgfqpoint{0.273cm}{1.359cm}}{\pgfqpoint{0.273cm}{1.395cm}}
\pgfusepath{fill}
\begin{pgfscope}
\pgfsetdash{}{0cm}
\pgfsetlinewidth{0.818mm}
\pgfsetmiterlimit{7.0}
\pgfpathmoveto{\pgfqpoint{0.682cm}{0.671cm}}
\pgfpathlineto{\pgfqpoint{0.679cm}{1.418cm}}
\pgfusepath{stroke}
\end{pgfscope}
\pgfpathmoveto{\pgfqpoint{0.815cm}{1.399cm}}
\pgfpathcurveto{\pgfqpoint{0.815cm}{1.435cm}}{\pgfqpoint{0.801cm}{1.47cm}}{\pgfqpoint{0.775cm}{1.496cm}}
\pgfpathcurveto{\pgfqpoint{0.75cm}{1.521cm}}{\pgfqpoint{0.715cm}{1.536cm}}{\pgfqpoint{0.679cm}{1.536cm}}
\pgfpathcurveto{\pgfqpoint{0.643cm}{1.536cm}}{\pgfqpoint{0.608cm}{1.521cm}}{\pgfqpoint{0.582cm}{1.496cm}}
\pgfpathcurveto{\pgfqpoint{0.557cm}{1.47cm}}{\pgfqpoint{0.542cm}{1.435cm}}{\pgfqpoint{0.542cm}{1.399cm}}
\pgfpathcurveto{\pgfqpoint{0.542cm}{1.363cm}}{\pgfqpoint{0.557cm}{1.328cm}}{\pgfqpoint{0.582cm}{1.302cm}}
\pgfpathcurveto{\pgfqpoint{0.608cm}{1.276cm}}{\pgfqpoint{0.643cm}{1.262cm}}{\pgfqpoint{0.679cm}{1.262cm}}
\pgfpathcurveto{\pgfqpoint{0.715cm}{1.262cm}}{\pgfqpoint{0.75cm}{1.276cm}}{\pgfqpoint{0.775cm}{1.302cm}}
\pgfpathcurveto{\pgfqpoint{0.801cm}{1.328cm}}{\pgfqpoint{0.815cm}{1.363cm}}{\pgfqpoint{0.815cm}{1.399cm}}
\pgfusepath{fill}
\pgfpathmoveto{\pgfqpoint{1.345cm}{1.371cm}}
\pgfpathcurveto{\pgfqpoint{1.345cm}{1.408cm}}{\pgfqpoint{1.331cm}{1.442cm}}{\pgfqpoint{1.305cm}{1.468cm}}
\pgfpathcurveto{\pgfqpoint{1.28cm}{1.494cm}}{\pgfqpoint{1.245cm}{1.508cm}}{\pgfqpoint{1.209cm}{1.508cm}}
\pgfpathcurveto{\pgfqpoint{1.172cm}{1.508cm}}{\pgfqpoint{1.138cm}{1.494cm}}{\pgfqpoint{1.112cm}{1.468cm}}
\pgfpathcurveto{\pgfqpoint{1.087cm}{1.442cm}}{\pgfqpoint{1.072cm}{1.408cm}}{\pgfqpoint{1.072cm}{1.371cm}}
\pgfpathcurveto{\pgfqpoint{1.072cm}{1.335cm}}{\pgfqpoint{1.087cm}{1.3cm}}{\pgfqpoint{1.112cm}{1.274cm}}
\pgfpathcurveto{\pgfqpoint{1.138cm}{1.249cm}}{\pgfqpoint{1.172cm}{1.234cm}}{\pgfqpoint{1.209cm}{1.234cm}}
\pgfpathcurveto{\pgfqpoint{1.245cm}{1.234cm}}{\pgfqpoint{1.28cm}{1.249cm}}{\pgfqpoint{1.305cm}{1.274cm}}
\pgfpathcurveto{\pgfqpoint{1.331cm}{1.3cm}}{\pgfqpoint{1.345cm}{1.335cm}}{\pgfqpoint{1.345cm}{1.371cm}}
\pgfusepath{fill}
\begin{pgfscope}
\pgfsetdash{}{0cm}
\pgfsetlinewidth{0.818mm}
\pgfsetroundcap
\pgfsetmiterlimit{4.0}
\pgfpathmoveto{\pgfqpoint{0.682cm}{0.671cm}}
\pgfpathlineto{\pgfqpoint{0.682cm}{0.042cm}}
\pgfusepath{stroke}
\end{pgfscope}
\end{pgfscope}
\end{pgfscope}
\end{pgfscope}
\end{tikzpicture}}}\succ(\phi+\psi))=6(\phi+\psi)X^{\!\resizebox{!}{.8em}{
\begin{tikzpicture}
\pgfpathmoveto{\pgfqpoint{0cm}{-0.035cm}}
\pgfpathlineto{\pgfqpoint{1.976cm}{-0.035cm}}
\pgfpathlineto{\pgfqpoint{1.976cm}{1.94cm}}
\pgfpathlineto{\pgfqpoint{0cm}{1.94cm}}
\pgfpathclose
\pgfusepath{clip}
\begin{pgfscope}
\begin{pgfscope}
\pgfpathmoveto{\pgfqpoint{0cm}{-0.035cm}}
\pgfpathlineto{\pgfqpoint{1.976cm}{-0.035cm}}
\pgfpathlineto{\pgfqpoint{1.976cm}{1.94cm}}
\pgfpathlineto{\pgfqpoint{0cm}{1.94cm}}
\pgfpathclose
\pgfusepath{clip}
\begin{pgfscope}
\begin{pgfscope}
\pgfsetdash{}{0cm}
\pgfsetlinewidth{0.818mm}
\pgfsetroundcap
\pgfsetroundjoin
\pgfsetmiterlimit{7.0}
\definecolor{eps2pgf_color}{gray}{0}\pgfsetstrokecolor{eps2pgf_color}\pgfsetfillcolor{eps2pgf_color}
\pgfpathmoveto{\pgfqpoint{0.117cm}{1.815cm}}
\pgfpathlineto{\pgfqpoint{0.682cm}{1.065cm}}
\pgfpathlineto{\pgfqpoint{1.246cm}{1.815cm}}
\pgfusepath{stroke}
\end{pgfscope}
\definecolor{eps2pgf_color}{gray}{0}\pgfsetstrokecolor{eps2pgf_color}\pgfsetfillcolor{eps2pgf_color}
\pgfpathmoveto{\pgfqpoint{0.273cm}{1.789cm}}
\pgfpathcurveto{\pgfqpoint{0.273cm}{1.825cm}}{\pgfqpoint{0.259cm}{1.86cm}}{\pgfqpoint{0.233cm}{1.886cm}}
\pgfpathcurveto{\pgfqpoint{0.207cm}{1.912cm}}{\pgfqpoint{0.173cm}{1.926cm}}{\pgfqpoint{0.137cm}{1.926cm}}
\pgfpathcurveto{\pgfqpoint{0.1cm}{1.926cm}}{\pgfqpoint{0.066cm}{1.912cm}}{\pgfqpoint{0.04cm}{1.886cm}}
\pgfpathcurveto{\pgfqpoint{0.014cm}{1.86cm}}{\pgfqpoint{0cm}{1.825cm}}{\pgfqpoint{0cm}{1.789cm}}
\pgfpathcurveto{\pgfqpoint{0cm}{1.753cm}}{\pgfqpoint{0.014cm}{1.718cm}}{\pgfqpoint{0.04cm}{1.692cm}}
\pgfpathcurveto{\pgfqpoint{0.066cm}{1.667cm}}{\pgfqpoint{0.1cm}{1.652cm}}{\pgfqpoint{0.137cm}{1.652cm}}
\pgfpathcurveto{\pgfqpoint{0.173cm}{1.652cm}}{\pgfqpoint{0.207cm}{1.667cm}}{\pgfqpoint{0.233cm}{1.692cm}}
\pgfpathcurveto{\pgfqpoint{0.259cm}{1.718cm}}{\pgfqpoint{0.273cm}{1.753cm}}{\pgfqpoint{0.273cm}{1.789cm}}
\pgfusepath{fill}
\begin{pgfscope}
\pgfsetdash{}{0cm}
\pgfsetlinewidth{0.818mm}
\pgfsetmiterlimit{7.0}
\pgfpathmoveto{\pgfqpoint{0.682cm}{1.065cm}}
\pgfpathlineto{\pgfqpoint{0.679cm}{1.812cm}}
\pgfusepath{stroke}
\end{pgfscope}
\pgfpathmoveto{\pgfqpoint{0.815cm}{1.793cm}}
\pgfpathcurveto{\pgfqpoint{0.815cm}{1.829cm}}{\pgfqpoint{0.801cm}{1.864cm}}{\pgfqpoint{0.775cm}{1.89cm}}
\pgfpathcurveto{\pgfqpoint{0.75cm}{1.915cm}}{\pgfqpoint{0.715cm}{1.93cm}}{\pgfqpoint{0.679cm}{1.93cm}}
\pgfpathcurveto{\pgfqpoint{0.643cm}{1.93cm}}{\pgfqpoint{0.608cm}{1.915cm}}{\pgfqpoint{0.582cm}{1.89cm}}
\pgfpathcurveto{\pgfqpoint{0.557cm}{1.864cm}}{\pgfqpoint{0.542cm}{1.829cm}}{\pgfqpoint{0.542cm}{1.793cm}}
\pgfpathcurveto{\pgfqpoint{0.542cm}{1.756cm}}{\pgfqpoint{0.557cm}{1.722cm}}{\pgfqpoint{0.582cm}{1.696cm}}
\pgfpathcurveto{\pgfqpoint{0.608cm}{1.67cm}}{\pgfqpoint{0.643cm}{1.656cm}}{\pgfqpoint{0.679cm}{1.656cm}}
\pgfpathcurveto{\pgfqpoint{0.715cm}{1.656cm}}{\pgfqpoint{0.75cm}{1.67cm}}{\pgfqpoint{0.775cm}{1.696cm}}
\pgfpathcurveto{\pgfqpoint{0.801cm}{1.722cm}}{\pgfqpoint{0.815cm}{1.756cm}}{\pgfqpoint{0.815cm}{1.793cm}}
\pgfusepath{fill}
\pgfpathmoveto{\pgfqpoint{1.345cm}{1.765cm}}
\pgfpathcurveto{\pgfqpoint{1.345cm}{1.801cm}}{\pgfqpoint{1.331cm}{1.836cm}}{\pgfqpoint{1.305cm}{1.862cm}}
\pgfpathcurveto{\pgfqpoint{1.28cm}{1.887cm}}{\pgfqpoint{1.245cm}{1.902cm}}{\pgfqpoint{1.209cm}{1.902cm}}
\pgfpathcurveto{\pgfqpoint{1.172cm}{1.902cm}}{\pgfqpoint{1.138cm}{1.887cm}}{\pgfqpoint{1.112cm}{1.862cm}}
\pgfpathcurveto{\pgfqpoint{1.087cm}{1.836cm}}{\pgfqpoint{1.072cm}{1.801cm}}{\pgfqpoint{1.072cm}{1.765cm}}
\pgfpathcurveto{\pgfqpoint{1.072cm}{1.728cm}}{\pgfqpoint{1.087cm}{1.694cm}}{\pgfqpoint{1.112cm}{1.668cm}}
\pgfpathcurveto{\pgfqpoint{1.138cm}{1.642cm}}{\pgfqpoint{1.172cm}{1.628cm}}{\pgfqpoint{1.209cm}{1.628cm}}
\pgfpathcurveto{\pgfqpoint{1.245cm}{1.628cm}}{\pgfqpoint{1.28cm}{1.642cm}}{\pgfqpoint{1.305cm}{1.668cm}}
\pgfpathcurveto{\pgfqpoint{1.331cm}{1.694cm}}{\pgfqpoint{1.345cm}{1.728cm}}{\pgfqpoint{1.345cm}{1.765cm}}
\pgfusepath{fill}
\begin{pgfscope}
\pgfsetdash{}{0cm}
\pgfsetlinewidth{0.818mm}
\pgfsetroundcap
\pgfsetroundjoin
\pgfsetmiterlimit{7.0}
\pgfpathmoveto{\pgfqpoint{0.682cm}{1.065cm}}
\pgfpathlineto{\pgfqpoint{1.246cm}{0.315cm}}
\pgfpathlineto{\pgfqpoint{1.811cm}{1.065cm}}
\pgfusepath{stroke}
\end{pgfscope}
\pgfpathmoveto{\pgfqpoint{1.948cm}{1.065cm}}
\pgfpathcurveto{\pgfqpoint{1.948cm}{1.101cm}}{\pgfqpoint{1.933cm}{1.136cm}}{\pgfqpoint{1.907cm}{1.162cm}}
\pgfpathcurveto{\pgfqpoint{1.882cm}{1.187cm}}{\pgfqpoint{1.847cm}{1.202cm}}{\pgfqpoint{1.811cm}{1.202cm}}
\pgfpathcurveto{\pgfqpoint{1.775cm}{1.202cm}}{\pgfqpoint{1.74cm}{1.187cm}}{\pgfqpoint{1.714cm}{1.162cm}}
\pgfpathcurveto{\pgfqpoint{1.689cm}{1.136cm}}{\pgfqpoint{1.674cm}{1.101cm}}{\pgfqpoint{1.674cm}{1.065cm}}
\pgfpathcurveto{\pgfqpoint{1.674cm}{1.029cm}}{\pgfqpoint{1.689cm}{0.994cm}}{\pgfqpoint{1.714cm}{0.968cm}}
\pgfpathcurveto{\pgfqpoint{1.74cm}{0.942cm}}{\pgfqpoint{1.775cm}{0.928cm}}{\pgfqpoint{1.811cm}{0.928cm}}
\pgfpathcurveto{\pgfqpoint{1.847cm}{0.928cm}}{\pgfqpoint{1.882cm}{0.942cm}}{\pgfqpoint{1.907cm}{0.968cm}}
\pgfpathcurveto{\pgfqpoint{1.933cm}{0.994cm}}{\pgfqpoint{1.948cm}{1.029cm}}{\pgfqpoint{1.948cm}{1.065cm}}
\pgfusepath{fill}
\begin{pgfscope}
\pgfsetdash{}{0cm}
\pgfsetlinewidth{0.818mm}
\pgfsetmiterlimit{4.0}
\pgfpathmoveto{\pgfqpoint{1.383cm}{0.178cm}}
\pgfpathcurveto{\pgfqpoint{1.383cm}{0.214cm}}{\pgfqpoint{1.369cm}{0.249cm}}{\pgfqpoint{1.343cm}{0.275cm}}
\pgfpathcurveto{\pgfqpoint{1.317cm}{0.3cm}}{\pgfqpoint{1.283cm}{0.315cm}}{\pgfqpoint{1.246cm}{0.315cm}}
\pgfpathcurveto{\pgfqpoint{1.21cm}{0.315cm}}{\pgfqpoint{1.175cm}{0.3cm}}{\pgfqpoint{1.15cm}{0.275cm}}
\pgfpathcurveto{\pgfqpoint{1.124cm}{0.249cm}}{\pgfqpoint{1.11cm}{0.214cm}}{\pgfqpoint{1.11cm}{0.178cm}}
\pgfpathcurveto{\pgfqpoint{1.11cm}{0.141cm}}{\pgfqpoint{1.124cm}{0.107cm}}{\pgfqpoint{1.15cm}{0.081cm}}
\pgfpathcurveto{\pgfqpoint{1.175cm}{0.055cm}}{\pgfqpoint{1.21cm}{0.041cm}}{\pgfqpoint{1.246cm}{0.041cm}}
\pgfpathcurveto{\pgfqpoint{1.283cm}{0.041cm}}{\pgfqpoint{1.317cm}{0.055cm}}{\pgfqpoint{1.343cm}{0.081cm}}
\pgfpathcurveto{\pgfqpoint{1.369cm}{0.107cm}}{\pgfqpoint{1.383cm}{0.141cm}}{\pgfqpoint{1.383cm}{0.178cm}}
\pgfusepath{stroke}
\end{pgfscope}
\end{pgfscope}
\end{pgfscope}
\end{pgfscope}
\end{tikzpicture}}}+6\mathrm{com}(\phi+\psi,X^{\!\resizebox{0.6em}{!}{
\begin{tikzpicture}
\pgfpathmoveto{\pgfqpoint{0cm}{-0.035cm}}
\pgfpathlineto{\pgfqpoint{1.376cm}{-0.035cm}}
\pgfpathlineto{\pgfqpoint{1.376cm}{1.552cm}}
\pgfpathlineto{\pgfqpoint{0cm}{1.552cm}}
\pgfpathclose
\pgfusepath{clip}
\begin{pgfscope}
\begin{pgfscope}
\pgfpathmoveto{\pgfqpoint{0cm}{-0.035cm}}
\pgfpathlineto{\pgfqpoint{1.376cm}{-0.035cm}}
\pgfpathlineto{\pgfqpoint{1.376cm}{1.552cm}}
\pgfpathlineto{\pgfqpoint{0cm}{1.552cm}}
\pgfpathclose
\pgfusepath{clip}
\begin{pgfscope}
\begin{pgfscope}
\pgfsetdash{}{0cm}
\pgfsetlinewidth{0.818mm}
\pgfsetroundcap
\pgfsetroundjoin
\pgfsetmiterlimit{7.0}
\definecolor{eps2pgf_color}{gray}{0}\pgfsetstrokecolor{eps2pgf_color}\pgfsetfillcolor{eps2pgf_color}
\pgfpathmoveto{\pgfqpoint{0.117cm}{1.421cm}}
\pgfpathlineto{\pgfqpoint{0.682cm}{0.671cm}}
\pgfpathlineto{\pgfqpoint{1.246cm}{1.421cm}}
\pgfusepath{stroke}
\end{pgfscope}
\definecolor{eps2pgf_color}{gray}{0}\pgfsetstrokecolor{eps2pgf_color}\pgfsetfillcolor{eps2pgf_color}
\pgfpathmoveto{\pgfqpoint{0.273cm}{1.395cm}}
\pgfpathcurveto{\pgfqpoint{0.273cm}{1.432cm}}{\pgfqpoint{0.259cm}{1.467cm}}{\pgfqpoint{0.233cm}{1.492cm}}
\pgfpathcurveto{\pgfqpoint{0.207cm}{1.518cm}}{\pgfqpoint{0.173cm}{1.532cm}}{\pgfqpoint{0.137cm}{1.532cm}}
\pgfpathcurveto{\pgfqpoint{0.1cm}{1.532cm}}{\pgfqpoint{0.066cm}{1.518cm}}{\pgfqpoint{0.04cm}{1.492cm}}
\pgfpathcurveto{\pgfqpoint{0.014cm}{1.467cm}}{\pgfqpoint{0cm}{1.432cm}}{\pgfqpoint{0cm}{1.395cm}}
\pgfpathcurveto{\pgfqpoint{0cm}{1.359cm}}{\pgfqpoint{0.014cm}{1.324cm}}{\pgfqpoint{0.04cm}{1.299cm}}
\pgfpathcurveto{\pgfqpoint{0.066cm}{1.273cm}}{\pgfqpoint{0.1cm}{1.258cm}}{\pgfqpoint{0.137cm}{1.258cm}}
\pgfpathcurveto{\pgfqpoint{0.173cm}{1.258cm}}{\pgfqpoint{0.207cm}{1.273cm}}{\pgfqpoint{0.233cm}{1.299cm}}
\pgfpathcurveto{\pgfqpoint{0.259cm}{1.324cm}}{\pgfqpoint{0.273cm}{1.359cm}}{\pgfqpoint{0.273cm}{1.395cm}}
\pgfusepath{fill}
\begin{pgfscope}
\pgfsetdash{}{0cm}
\pgfsetlinewidth{0.818mm}
\pgfsetmiterlimit{7.0}
\pgfpathmoveto{\pgfqpoint{0.682cm}{0.671cm}}
\pgfpathlineto{\pgfqpoint{0.679cm}{1.418cm}}
\pgfusepath{stroke}
\end{pgfscope}
\pgfpathmoveto{\pgfqpoint{0.815cm}{1.399cm}}
\pgfpathcurveto{\pgfqpoint{0.815cm}{1.435cm}}{\pgfqpoint{0.801cm}{1.47cm}}{\pgfqpoint{0.775cm}{1.496cm}}
\pgfpathcurveto{\pgfqpoint{0.75cm}{1.521cm}}{\pgfqpoint{0.715cm}{1.536cm}}{\pgfqpoint{0.679cm}{1.536cm}}
\pgfpathcurveto{\pgfqpoint{0.643cm}{1.536cm}}{\pgfqpoint{0.608cm}{1.521cm}}{\pgfqpoint{0.582cm}{1.496cm}}
\pgfpathcurveto{\pgfqpoint{0.557cm}{1.47cm}}{\pgfqpoint{0.542cm}{1.435cm}}{\pgfqpoint{0.542cm}{1.399cm}}
\pgfpathcurveto{\pgfqpoint{0.542cm}{1.363cm}}{\pgfqpoint{0.557cm}{1.328cm}}{\pgfqpoint{0.582cm}{1.302cm}}
\pgfpathcurveto{\pgfqpoint{0.608cm}{1.276cm}}{\pgfqpoint{0.643cm}{1.262cm}}{\pgfqpoint{0.679cm}{1.262cm}}
\pgfpathcurveto{\pgfqpoint{0.715cm}{1.262cm}}{\pgfqpoint{0.75cm}{1.276cm}}{\pgfqpoint{0.775cm}{1.302cm}}
\pgfpathcurveto{\pgfqpoint{0.801cm}{1.328cm}}{\pgfqpoint{0.815cm}{1.363cm}}{\pgfqpoint{0.815cm}{1.399cm}}
\pgfusepath{fill}
\pgfpathmoveto{\pgfqpoint{1.345cm}{1.371cm}}
\pgfpathcurveto{\pgfqpoint{1.345cm}{1.408cm}}{\pgfqpoint{1.331cm}{1.442cm}}{\pgfqpoint{1.305cm}{1.468cm}}
\pgfpathcurveto{\pgfqpoint{1.28cm}{1.494cm}}{\pgfqpoint{1.245cm}{1.508cm}}{\pgfqpoint{1.209cm}{1.508cm}}
\pgfpathcurveto{\pgfqpoint{1.172cm}{1.508cm}}{\pgfqpoint{1.138cm}{1.494cm}}{\pgfqpoint{1.112cm}{1.468cm}}
\pgfpathcurveto{\pgfqpoint{1.087cm}{1.442cm}}{\pgfqpoint{1.072cm}{1.408cm}}{\pgfqpoint{1.072cm}{1.371cm}}
\pgfpathcurveto{\pgfqpoint{1.072cm}{1.335cm}}{\pgfqpoint{1.087cm}{1.3cm}}{\pgfqpoint{1.112cm}{1.274cm}}
\pgfpathcurveto{\pgfqpoint{1.138cm}{1.249cm}}{\pgfqpoint{1.172cm}{1.234cm}}{\pgfqpoint{1.209cm}{1.234cm}}
\pgfpathcurveto{\pgfqpoint{1.245cm}{1.234cm}}{\pgfqpoint{1.28cm}{1.249cm}}{\pgfqpoint{1.305cm}{1.274cm}}
\pgfpathcurveto{\pgfqpoint{1.331cm}{1.3cm}}{\pgfqpoint{1.345cm}{1.335cm}}{\pgfqpoint{1.345cm}{1.371cm}}
\pgfusepath{fill}
\begin{pgfscope}
\pgfsetdash{}{0cm}
\pgfsetlinewidth{0.818mm}
\pgfsetroundcap
\pgfsetmiterlimit{4.0}
\pgfpathmoveto{\pgfqpoint{0.682cm}{0.671cm}}
\pgfpathlineto{\pgfqpoint{0.682cm}{0.042cm}}
\pgfusepath{stroke}
\end{pgfscope}
\end{pgfscope}
\end{pgfscope}
\end{pgfscope}
\end{tikzpicture}}},X),
$$
and observe that all the other terms are well-defined.
Thus we obtain
\begin{align*}
3X(-X^{\!\resizebox{0.6em}{!}{
\begin{tikzpicture}
\pgfpathmoveto{\pgfqpoint{0cm}{-0.035cm}}
\pgfpathlineto{\pgfqpoint{1.376cm}{-0.035cm}}
\pgfpathlineto{\pgfqpoint{1.376cm}{1.552cm}}
\pgfpathlineto{\pgfqpoint{0cm}{1.552cm}}
\pgfpathclose
\pgfusepath{clip}
\begin{pgfscope}
\begin{pgfscope}
\pgfpathmoveto{\pgfqpoint{0cm}{-0.035cm}}
\pgfpathlineto{\pgfqpoint{1.376cm}{-0.035cm}}
\pgfpathlineto{\pgfqpoint{1.376cm}{1.552cm}}
\pgfpathlineto{\pgfqpoint{0cm}{1.552cm}}
\pgfpathclose
\pgfusepath{clip}
\begin{pgfscope}
\begin{pgfscope}
\pgfsetdash{}{0cm}
\pgfsetlinewidth{0.818mm}
\pgfsetroundcap
\pgfsetroundjoin
\pgfsetmiterlimit{7.0}
\definecolor{eps2pgf_color}{gray}{0}\pgfsetstrokecolor{eps2pgf_color}\pgfsetfillcolor{eps2pgf_color}
\pgfpathmoveto{\pgfqpoint{0.117cm}{1.421cm}}
\pgfpathlineto{\pgfqpoint{0.682cm}{0.671cm}}
\pgfpathlineto{\pgfqpoint{1.246cm}{1.421cm}}
\pgfusepath{stroke}
\end{pgfscope}
\definecolor{eps2pgf_color}{gray}{0}\pgfsetstrokecolor{eps2pgf_color}\pgfsetfillcolor{eps2pgf_color}
\pgfpathmoveto{\pgfqpoint{0.273cm}{1.395cm}}
\pgfpathcurveto{\pgfqpoint{0.273cm}{1.432cm}}{\pgfqpoint{0.259cm}{1.467cm}}{\pgfqpoint{0.233cm}{1.492cm}}
\pgfpathcurveto{\pgfqpoint{0.207cm}{1.518cm}}{\pgfqpoint{0.173cm}{1.532cm}}{\pgfqpoint{0.137cm}{1.532cm}}
\pgfpathcurveto{\pgfqpoint{0.1cm}{1.532cm}}{\pgfqpoint{0.066cm}{1.518cm}}{\pgfqpoint{0.04cm}{1.492cm}}
\pgfpathcurveto{\pgfqpoint{0.014cm}{1.467cm}}{\pgfqpoint{0cm}{1.432cm}}{\pgfqpoint{0cm}{1.395cm}}
\pgfpathcurveto{\pgfqpoint{0cm}{1.359cm}}{\pgfqpoint{0.014cm}{1.324cm}}{\pgfqpoint{0.04cm}{1.299cm}}
\pgfpathcurveto{\pgfqpoint{0.066cm}{1.273cm}}{\pgfqpoint{0.1cm}{1.258cm}}{\pgfqpoint{0.137cm}{1.258cm}}
\pgfpathcurveto{\pgfqpoint{0.173cm}{1.258cm}}{\pgfqpoint{0.207cm}{1.273cm}}{\pgfqpoint{0.233cm}{1.299cm}}
\pgfpathcurveto{\pgfqpoint{0.259cm}{1.324cm}}{\pgfqpoint{0.273cm}{1.359cm}}{\pgfqpoint{0.273cm}{1.395cm}}
\pgfusepath{fill}
\begin{pgfscope}
\pgfsetdash{}{0cm}
\pgfsetlinewidth{0.818mm}
\pgfsetmiterlimit{7.0}
\pgfpathmoveto{\pgfqpoint{0.682cm}{0.671cm}}
\pgfpathlineto{\pgfqpoint{0.679cm}{1.418cm}}
\pgfusepath{stroke}
\end{pgfscope}
\pgfpathmoveto{\pgfqpoint{0.815cm}{1.399cm}}
\pgfpathcurveto{\pgfqpoint{0.815cm}{1.435cm}}{\pgfqpoint{0.801cm}{1.47cm}}{\pgfqpoint{0.775cm}{1.496cm}}
\pgfpathcurveto{\pgfqpoint{0.75cm}{1.521cm}}{\pgfqpoint{0.715cm}{1.536cm}}{\pgfqpoint{0.679cm}{1.536cm}}
\pgfpathcurveto{\pgfqpoint{0.643cm}{1.536cm}}{\pgfqpoint{0.608cm}{1.521cm}}{\pgfqpoint{0.582cm}{1.496cm}}
\pgfpathcurveto{\pgfqpoint{0.557cm}{1.47cm}}{\pgfqpoint{0.542cm}{1.435cm}}{\pgfqpoint{0.542cm}{1.399cm}}
\pgfpathcurveto{\pgfqpoint{0.542cm}{1.363cm}}{\pgfqpoint{0.557cm}{1.328cm}}{\pgfqpoint{0.582cm}{1.302cm}}
\pgfpathcurveto{\pgfqpoint{0.608cm}{1.276cm}}{\pgfqpoint{0.643cm}{1.262cm}}{\pgfqpoint{0.679cm}{1.262cm}}
\pgfpathcurveto{\pgfqpoint{0.715cm}{1.262cm}}{\pgfqpoint{0.75cm}{1.276cm}}{\pgfqpoint{0.775cm}{1.302cm}}
\pgfpathcurveto{\pgfqpoint{0.801cm}{1.328cm}}{\pgfqpoint{0.815cm}{1.363cm}}{\pgfqpoint{0.815cm}{1.399cm}}
\pgfusepath{fill}
\pgfpathmoveto{\pgfqpoint{1.345cm}{1.371cm}}
\pgfpathcurveto{\pgfqpoint{1.345cm}{1.408cm}}{\pgfqpoint{1.331cm}{1.442cm}}{\pgfqpoint{1.305cm}{1.468cm}}
\pgfpathcurveto{\pgfqpoint{1.28cm}{1.494cm}}{\pgfqpoint{1.245cm}{1.508cm}}{\pgfqpoint{1.209cm}{1.508cm}}
\pgfpathcurveto{\pgfqpoint{1.172cm}{1.508cm}}{\pgfqpoint{1.138cm}{1.494cm}}{\pgfqpoint{1.112cm}{1.468cm}}
\pgfpathcurveto{\pgfqpoint{1.087cm}{1.442cm}}{\pgfqpoint{1.072cm}{1.408cm}}{\pgfqpoint{1.072cm}{1.371cm}}
\pgfpathcurveto{\pgfqpoint{1.072cm}{1.335cm}}{\pgfqpoint{1.087cm}{1.3cm}}{\pgfqpoint{1.112cm}{1.274cm}}
\pgfpathcurveto{\pgfqpoint{1.138cm}{1.249cm}}{\pgfqpoint{1.172cm}{1.234cm}}{\pgfqpoint{1.209cm}{1.234cm}}
\pgfpathcurveto{\pgfqpoint{1.245cm}{1.234cm}}{\pgfqpoint{1.28cm}{1.249cm}}{\pgfqpoint{1.305cm}{1.274cm}}
\pgfpathcurveto{\pgfqpoint{1.331cm}{1.3cm}}{\pgfqpoint{1.345cm}{1.335cm}}{\pgfqpoint{1.345cm}{1.371cm}}
\pgfusepath{fill}
\begin{pgfscope}
\pgfsetdash{}{0cm}
\pgfsetlinewidth{0.818mm}
\pgfsetroundcap
\pgfsetmiterlimit{4.0}
\pgfpathmoveto{\pgfqpoint{0.682cm}{0.671cm}}
\pgfpathlineto{\pgfqpoint{0.682cm}{0.042cm}}
\pgfusepath{stroke}
\end{pgfscope}
\end{pgfscope}
\end{pgfscope}
\end{pgfscope}
\end{tikzpicture}}}+\phi+\psi)^2&=
\rmm{3X\succ(X^{\!\resizebox{0.6em}{!}{
\begin{tikzpicture}
\pgfpathmoveto{\pgfqpoint{0cm}{-0.035cm}}
\pgfpathlineto{\pgfqpoint{1.376cm}{-0.035cm}}
\pgfpathlineto{\pgfqpoint{1.376cm}{1.552cm}}
\pgfpathlineto{\pgfqpoint{0cm}{1.552cm}}
\pgfpathclose
\pgfusepath{clip}
\begin{pgfscope}
\begin{pgfscope}
\pgfpathmoveto{\pgfqpoint{0cm}{-0.035cm}}
\pgfpathlineto{\pgfqpoint{1.376cm}{-0.035cm}}
\pgfpathlineto{\pgfqpoint{1.376cm}{1.552cm}}
\pgfpathlineto{\pgfqpoint{0cm}{1.552cm}}
\pgfpathclose
\pgfusepath{clip}
\begin{pgfscope}
\begin{pgfscope}
\pgfsetdash{}{0cm}
\pgfsetlinewidth{0.818mm}
\pgfsetroundcap
\pgfsetroundjoin
\pgfsetmiterlimit{7.0}
\definecolor{eps2pgf_color}{gray}{0}\pgfsetstrokecolor{eps2pgf_color}\pgfsetfillcolor{eps2pgf_color}
\pgfpathmoveto{\pgfqpoint{0.117cm}{1.421cm}}
\pgfpathlineto{\pgfqpoint{0.682cm}{0.671cm}}
\pgfpathlineto{\pgfqpoint{1.246cm}{1.421cm}}
\pgfusepath{stroke}
\end{pgfscope}
\definecolor{eps2pgf_color}{gray}{0}\pgfsetstrokecolor{eps2pgf_color}\pgfsetfillcolor{eps2pgf_color}
\pgfpathmoveto{\pgfqpoint{0.273cm}{1.395cm}}
\pgfpathcurveto{\pgfqpoint{0.273cm}{1.432cm}}{\pgfqpoint{0.259cm}{1.467cm}}{\pgfqpoint{0.233cm}{1.492cm}}
\pgfpathcurveto{\pgfqpoint{0.207cm}{1.518cm}}{\pgfqpoint{0.173cm}{1.532cm}}{\pgfqpoint{0.137cm}{1.532cm}}
\pgfpathcurveto{\pgfqpoint{0.1cm}{1.532cm}}{\pgfqpoint{0.066cm}{1.518cm}}{\pgfqpoint{0.04cm}{1.492cm}}
\pgfpathcurveto{\pgfqpoint{0.014cm}{1.467cm}}{\pgfqpoint{0cm}{1.432cm}}{\pgfqpoint{0cm}{1.395cm}}
\pgfpathcurveto{\pgfqpoint{0cm}{1.359cm}}{\pgfqpoint{0.014cm}{1.324cm}}{\pgfqpoint{0.04cm}{1.299cm}}
\pgfpathcurveto{\pgfqpoint{0.066cm}{1.273cm}}{\pgfqpoint{0.1cm}{1.258cm}}{\pgfqpoint{0.137cm}{1.258cm}}
\pgfpathcurveto{\pgfqpoint{0.173cm}{1.258cm}}{\pgfqpoint{0.207cm}{1.273cm}}{\pgfqpoint{0.233cm}{1.299cm}}
\pgfpathcurveto{\pgfqpoint{0.259cm}{1.324cm}}{\pgfqpoint{0.273cm}{1.359cm}}{\pgfqpoint{0.273cm}{1.395cm}}
\pgfusepath{fill}
\begin{pgfscope}
\pgfsetdash{}{0cm}
\pgfsetlinewidth{0.818mm}
\pgfsetmiterlimit{7.0}
\pgfpathmoveto{\pgfqpoint{0.682cm}{0.671cm}}
\pgfpathlineto{\pgfqpoint{0.679cm}{1.418cm}}
\pgfusepath{stroke}
\end{pgfscope}
\pgfpathmoveto{\pgfqpoint{0.815cm}{1.399cm}}
\pgfpathcurveto{\pgfqpoint{0.815cm}{1.435cm}}{\pgfqpoint{0.801cm}{1.47cm}}{\pgfqpoint{0.775cm}{1.496cm}}
\pgfpathcurveto{\pgfqpoint{0.75cm}{1.521cm}}{\pgfqpoint{0.715cm}{1.536cm}}{\pgfqpoint{0.679cm}{1.536cm}}
\pgfpathcurveto{\pgfqpoint{0.643cm}{1.536cm}}{\pgfqpoint{0.608cm}{1.521cm}}{\pgfqpoint{0.582cm}{1.496cm}}
\pgfpathcurveto{\pgfqpoint{0.557cm}{1.47cm}}{\pgfqpoint{0.542cm}{1.435cm}}{\pgfqpoint{0.542cm}{1.399cm}}
\pgfpathcurveto{\pgfqpoint{0.542cm}{1.363cm}}{\pgfqpoint{0.557cm}{1.328cm}}{\pgfqpoint{0.582cm}{1.302cm}}
\pgfpathcurveto{\pgfqpoint{0.608cm}{1.276cm}}{\pgfqpoint{0.643cm}{1.262cm}}{\pgfqpoint{0.679cm}{1.262cm}}
\pgfpathcurveto{\pgfqpoint{0.715cm}{1.262cm}}{\pgfqpoint{0.75cm}{1.276cm}}{\pgfqpoint{0.775cm}{1.302cm}}
\pgfpathcurveto{\pgfqpoint{0.801cm}{1.328cm}}{\pgfqpoint{0.815cm}{1.363cm}}{\pgfqpoint{0.815cm}{1.399cm}}
\pgfusepath{fill}
\pgfpathmoveto{\pgfqpoint{1.345cm}{1.371cm}}
\pgfpathcurveto{\pgfqpoint{1.345cm}{1.408cm}}{\pgfqpoint{1.331cm}{1.442cm}}{\pgfqpoint{1.305cm}{1.468cm}}
\pgfpathcurveto{\pgfqpoint{1.28cm}{1.494cm}}{\pgfqpoint{1.245cm}{1.508cm}}{\pgfqpoint{1.209cm}{1.508cm}}
\pgfpathcurveto{\pgfqpoint{1.172cm}{1.508cm}}{\pgfqpoint{1.138cm}{1.494cm}}{\pgfqpoint{1.112cm}{1.468cm}}
\pgfpathcurveto{\pgfqpoint{1.087cm}{1.442cm}}{\pgfqpoint{1.072cm}{1.408cm}}{\pgfqpoint{1.072cm}{1.371cm}}
\pgfpathcurveto{\pgfqpoint{1.072cm}{1.335cm}}{\pgfqpoint{1.087cm}{1.3cm}}{\pgfqpoint{1.112cm}{1.274cm}}
\pgfpathcurveto{\pgfqpoint{1.138cm}{1.249cm}}{\pgfqpoint{1.172cm}{1.234cm}}{\pgfqpoint{1.209cm}{1.234cm}}
\pgfpathcurveto{\pgfqpoint{1.245cm}{1.234cm}}{\pgfqpoint{1.28cm}{1.249cm}}{\pgfqpoint{1.305cm}{1.274cm}}
\pgfpathcurveto{\pgfqpoint{1.331cm}{1.3cm}}{\pgfqpoint{1.345cm}{1.335cm}}{\pgfqpoint{1.345cm}{1.371cm}}
\pgfusepath{fill}
\begin{pgfscope}
\pgfsetdash{}{0cm}
\pgfsetlinewidth{0.818mm}
\pgfsetroundcap
\pgfsetmiterlimit{4.0}
\pgfpathmoveto{\pgfqpoint{0.682cm}{0.671cm}}
\pgfpathlineto{\pgfqpoint{0.682cm}{0.042cm}}
\pgfusepath{stroke}
\end{pgfscope}
\end{pgfscope}
\end{pgfscope}
\end{pgfscope}
\end{tikzpicture}}})^2}+\rmm{3X\prec(X^{\!\resizebox{0.6em}{!}{
\begin{tikzpicture}
\pgfpathmoveto{\pgfqpoint{0cm}{-0.035cm}}
\pgfpathlineto{\pgfqpoint{1.376cm}{-0.035cm}}
\pgfpathlineto{\pgfqpoint{1.376cm}{1.552cm}}
\pgfpathlineto{\pgfqpoint{0cm}{1.552cm}}
\pgfpathclose
\pgfusepath{clip}
\begin{pgfscope}
\begin{pgfscope}
\pgfpathmoveto{\pgfqpoint{0cm}{-0.035cm}}
\pgfpathlineto{\pgfqpoint{1.376cm}{-0.035cm}}
\pgfpathlineto{\pgfqpoint{1.376cm}{1.552cm}}
\pgfpathlineto{\pgfqpoint{0cm}{1.552cm}}
\pgfpathclose
\pgfusepath{clip}
\begin{pgfscope}
\begin{pgfscope}
\pgfsetdash{}{0cm}
\pgfsetlinewidth{0.818mm}
\pgfsetroundcap
\pgfsetroundjoin
\pgfsetmiterlimit{7.0}
\definecolor{eps2pgf_color}{gray}{0}\pgfsetstrokecolor{eps2pgf_color}\pgfsetfillcolor{eps2pgf_color}
\pgfpathmoveto{\pgfqpoint{0.117cm}{1.421cm}}
\pgfpathlineto{\pgfqpoint{0.682cm}{0.671cm}}
\pgfpathlineto{\pgfqpoint{1.246cm}{1.421cm}}
\pgfusepath{stroke}
\end{pgfscope}
\definecolor{eps2pgf_color}{gray}{0}\pgfsetstrokecolor{eps2pgf_color}\pgfsetfillcolor{eps2pgf_color}
\pgfpathmoveto{\pgfqpoint{0.273cm}{1.395cm}}
\pgfpathcurveto{\pgfqpoint{0.273cm}{1.432cm}}{\pgfqpoint{0.259cm}{1.467cm}}{\pgfqpoint{0.233cm}{1.492cm}}
\pgfpathcurveto{\pgfqpoint{0.207cm}{1.518cm}}{\pgfqpoint{0.173cm}{1.532cm}}{\pgfqpoint{0.137cm}{1.532cm}}
\pgfpathcurveto{\pgfqpoint{0.1cm}{1.532cm}}{\pgfqpoint{0.066cm}{1.518cm}}{\pgfqpoint{0.04cm}{1.492cm}}
\pgfpathcurveto{\pgfqpoint{0.014cm}{1.467cm}}{\pgfqpoint{0cm}{1.432cm}}{\pgfqpoint{0cm}{1.395cm}}
\pgfpathcurveto{\pgfqpoint{0cm}{1.359cm}}{\pgfqpoint{0.014cm}{1.324cm}}{\pgfqpoint{0.04cm}{1.299cm}}
\pgfpathcurveto{\pgfqpoint{0.066cm}{1.273cm}}{\pgfqpoint{0.1cm}{1.258cm}}{\pgfqpoint{0.137cm}{1.258cm}}
\pgfpathcurveto{\pgfqpoint{0.173cm}{1.258cm}}{\pgfqpoint{0.207cm}{1.273cm}}{\pgfqpoint{0.233cm}{1.299cm}}
\pgfpathcurveto{\pgfqpoint{0.259cm}{1.324cm}}{\pgfqpoint{0.273cm}{1.359cm}}{\pgfqpoint{0.273cm}{1.395cm}}
\pgfusepath{fill}
\begin{pgfscope}
\pgfsetdash{}{0cm}
\pgfsetlinewidth{0.818mm}
\pgfsetmiterlimit{7.0}
\pgfpathmoveto{\pgfqpoint{0.682cm}{0.671cm}}
\pgfpathlineto{\pgfqpoint{0.679cm}{1.418cm}}
\pgfusepath{stroke}
\end{pgfscope}
\pgfpathmoveto{\pgfqpoint{0.815cm}{1.399cm}}
\pgfpathcurveto{\pgfqpoint{0.815cm}{1.435cm}}{\pgfqpoint{0.801cm}{1.47cm}}{\pgfqpoint{0.775cm}{1.496cm}}
\pgfpathcurveto{\pgfqpoint{0.75cm}{1.521cm}}{\pgfqpoint{0.715cm}{1.536cm}}{\pgfqpoint{0.679cm}{1.536cm}}
\pgfpathcurveto{\pgfqpoint{0.643cm}{1.536cm}}{\pgfqpoint{0.608cm}{1.521cm}}{\pgfqpoint{0.582cm}{1.496cm}}
\pgfpathcurveto{\pgfqpoint{0.557cm}{1.47cm}}{\pgfqpoint{0.542cm}{1.435cm}}{\pgfqpoint{0.542cm}{1.399cm}}
\pgfpathcurveto{\pgfqpoint{0.542cm}{1.363cm}}{\pgfqpoint{0.557cm}{1.328cm}}{\pgfqpoint{0.582cm}{1.302cm}}
\pgfpathcurveto{\pgfqpoint{0.608cm}{1.276cm}}{\pgfqpoint{0.643cm}{1.262cm}}{\pgfqpoint{0.679cm}{1.262cm}}
\pgfpathcurveto{\pgfqpoint{0.715cm}{1.262cm}}{\pgfqpoint{0.75cm}{1.276cm}}{\pgfqpoint{0.775cm}{1.302cm}}
\pgfpathcurveto{\pgfqpoint{0.801cm}{1.328cm}}{\pgfqpoint{0.815cm}{1.363cm}}{\pgfqpoint{0.815cm}{1.399cm}}
\pgfusepath{fill}
\pgfpathmoveto{\pgfqpoint{1.345cm}{1.371cm}}
\pgfpathcurveto{\pgfqpoint{1.345cm}{1.408cm}}{\pgfqpoint{1.331cm}{1.442cm}}{\pgfqpoint{1.305cm}{1.468cm}}
\pgfpathcurveto{\pgfqpoint{1.28cm}{1.494cm}}{\pgfqpoint{1.245cm}{1.508cm}}{\pgfqpoint{1.209cm}{1.508cm}}
\pgfpathcurveto{\pgfqpoint{1.172cm}{1.508cm}}{\pgfqpoint{1.138cm}{1.494cm}}{\pgfqpoint{1.112cm}{1.468cm}}
\pgfpathcurveto{\pgfqpoint{1.087cm}{1.442cm}}{\pgfqpoint{1.072cm}{1.408cm}}{\pgfqpoint{1.072cm}{1.371cm}}
\pgfpathcurveto{\pgfqpoint{1.072cm}{1.335cm}}{\pgfqpoint{1.087cm}{1.3cm}}{\pgfqpoint{1.112cm}{1.274cm}}
\pgfpathcurveto{\pgfqpoint{1.138cm}{1.249cm}}{\pgfqpoint{1.172cm}{1.234cm}}{\pgfqpoint{1.209cm}{1.234cm}}
\pgfpathcurveto{\pgfqpoint{1.245cm}{1.234cm}}{\pgfqpoint{1.28cm}{1.249cm}}{\pgfqpoint{1.305cm}{1.274cm}}
\pgfpathcurveto{\pgfqpoint{1.331cm}{1.3cm}}{\pgfqpoint{1.345cm}{1.335cm}}{\pgfqpoint{1.345cm}{1.371cm}}
\pgfusepath{fill}
\begin{pgfscope}
\pgfsetdash{}{0cm}
\pgfsetlinewidth{0.818mm}
\pgfsetroundcap
\pgfsetmiterlimit{4.0}
\pgfpathmoveto{\pgfqpoint{0.682cm}{0.671cm}}
\pgfpathlineto{\pgfqpoint{0.682cm}{0.042cm}}
\pgfusepath{stroke}
\end{pgfscope}
\end{pgfscope}
\end{pgfscope}
\end{pgfscope}
\end{tikzpicture}}})^2}+\rmm{6X^{\!\resizebox{0.6em}{!}{
\begin{tikzpicture}
\pgfpathmoveto{\pgfqpoint{0cm}{-0.035cm}}
\pgfpathlineto{\pgfqpoint{1.376cm}{-0.035cm}}
\pgfpathlineto{\pgfqpoint{1.376cm}{1.552cm}}
\pgfpathlineto{\pgfqpoint{0cm}{1.552cm}}
\pgfpathclose
\pgfusepath{clip}
\begin{pgfscope}
\begin{pgfscope}
\pgfpathmoveto{\pgfqpoint{0cm}{-0.035cm}}
\pgfpathlineto{\pgfqpoint{1.376cm}{-0.035cm}}
\pgfpathlineto{\pgfqpoint{1.376cm}{1.552cm}}
\pgfpathlineto{\pgfqpoint{0cm}{1.552cm}}
\pgfpathclose
\pgfusepath{clip}
\begin{pgfscope}
\begin{pgfscope}
\pgfsetdash{}{0cm}
\pgfsetlinewidth{0.818mm}
\pgfsetroundcap
\pgfsetroundjoin
\pgfsetmiterlimit{7.0}
\definecolor{eps2pgf_color}{gray}{0}\pgfsetstrokecolor{eps2pgf_color}\pgfsetfillcolor{eps2pgf_color}
\pgfpathmoveto{\pgfqpoint{0.117cm}{1.421cm}}
\pgfpathlineto{\pgfqpoint{0.682cm}{0.671cm}}
\pgfpathlineto{\pgfqpoint{1.246cm}{1.421cm}}
\pgfusepath{stroke}
\end{pgfscope}
\definecolor{eps2pgf_color}{gray}{0}\pgfsetstrokecolor{eps2pgf_color}\pgfsetfillcolor{eps2pgf_color}
\pgfpathmoveto{\pgfqpoint{0.273cm}{1.395cm}}
\pgfpathcurveto{\pgfqpoint{0.273cm}{1.432cm}}{\pgfqpoint{0.259cm}{1.467cm}}{\pgfqpoint{0.233cm}{1.492cm}}
\pgfpathcurveto{\pgfqpoint{0.207cm}{1.518cm}}{\pgfqpoint{0.173cm}{1.532cm}}{\pgfqpoint{0.137cm}{1.532cm}}
\pgfpathcurveto{\pgfqpoint{0.1cm}{1.532cm}}{\pgfqpoint{0.066cm}{1.518cm}}{\pgfqpoint{0.04cm}{1.492cm}}
\pgfpathcurveto{\pgfqpoint{0.014cm}{1.467cm}}{\pgfqpoint{0cm}{1.432cm}}{\pgfqpoint{0cm}{1.395cm}}
\pgfpathcurveto{\pgfqpoint{0cm}{1.359cm}}{\pgfqpoint{0.014cm}{1.324cm}}{\pgfqpoint{0.04cm}{1.299cm}}
\pgfpathcurveto{\pgfqpoint{0.066cm}{1.273cm}}{\pgfqpoint{0.1cm}{1.258cm}}{\pgfqpoint{0.137cm}{1.258cm}}
\pgfpathcurveto{\pgfqpoint{0.173cm}{1.258cm}}{\pgfqpoint{0.207cm}{1.273cm}}{\pgfqpoint{0.233cm}{1.299cm}}
\pgfpathcurveto{\pgfqpoint{0.259cm}{1.324cm}}{\pgfqpoint{0.273cm}{1.359cm}}{\pgfqpoint{0.273cm}{1.395cm}}
\pgfusepath{fill}
\begin{pgfscope}
\pgfsetdash{}{0cm}
\pgfsetlinewidth{0.818mm}
\pgfsetmiterlimit{7.0}
\pgfpathmoveto{\pgfqpoint{0.682cm}{0.671cm}}
\pgfpathlineto{\pgfqpoint{0.679cm}{1.418cm}}
\pgfusepath{stroke}
\end{pgfscope}
\pgfpathmoveto{\pgfqpoint{0.815cm}{1.399cm}}
\pgfpathcurveto{\pgfqpoint{0.815cm}{1.435cm}}{\pgfqpoint{0.801cm}{1.47cm}}{\pgfqpoint{0.775cm}{1.496cm}}
\pgfpathcurveto{\pgfqpoint{0.75cm}{1.521cm}}{\pgfqpoint{0.715cm}{1.536cm}}{\pgfqpoint{0.679cm}{1.536cm}}
\pgfpathcurveto{\pgfqpoint{0.643cm}{1.536cm}}{\pgfqpoint{0.608cm}{1.521cm}}{\pgfqpoint{0.582cm}{1.496cm}}
\pgfpathcurveto{\pgfqpoint{0.557cm}{1.47cm}}{\pgfqpoint{0.542cm}{1.435cm}}{\pgfqpoint{0.542cm}{1.399cm}}
\pgfpathcurveto{\pgfqpoint{0.542cm}{1.363cm}}{\pgfqpoint{0.557cm}{1.328cm}}{\pgfqpoint{0.582cm}{1.302cm}}
\pgfpathcurveto{\pgfqpoint{0.608cm}{1.276cm}}{\pgfqpoint{0.643cm}{1.262cm}}{\pgfqpoint{0.679cm}{1.262cm}}
\pgfpathcurveto{\pgfqpoint{0.715cm}{1.262cm}}{\pgfqpoint{0.75cm}{1.276cm}}{\pgfqpoint{0.775cm}{1.302cm}}
\pgfpathcurveto{\pgfqpoint{0.801cm}{1.328cm}}{\pgfqpoint{0.815cm}{1.363cm}}{\pgfqpoint{0.815cm}{1.399cm}}
\pgfusepath{fill}
\pgfpathmoveto{\pgfqpoint{1.345cm}{1.371cm}}
\pgfpathcurveto{\pgfqpoint{1.345cm}{1.408cm}}{\pgfqpoint{1.331cm}{1.442cm}}{\pgfqpoint{1.305cm}{1.468cm}}
\pgfpathcurveto{\pgfqpoint{1.28cm}{1.494cm}}{\pgfqpoint{1.245cm}{1.508cm}}{\pgfqpoint{1.209cm}{1.508cm}}
\pgfpathcurveto{\pgfqpoint{1.172cm}{1.508cm}}{\pgfqpoint{1.138cm}{1.494cm}}{\pgfqpoint{1.112cm}{1.468cm}}
\pgfpathcurveto{\pgfqpoint{1.087cm}{1.442cm}}{\pgfqpoint{1.072cm}{1.408cm}}{\pgfqpoint{1.072cm}{1.371cm}}
\pgfpathcurveto{\pgfqpoint{1.072cm}{1.335cm}}{\pgfqpoint{1.087cm}{1.3cm}}{\pgfqpoint{1.112cm}{1.274cm}}
\pgfpathcurveto{\pgfqpoint{1.138cm}{1.249cm}}{\pgfqpoint{1.172cm}{1.234cm}}{\pgfqpoint{1.209cm}{1.234cm}}
\pgfpathcurveto{\pgfqpoint{1.245cm}{1.234cm}}{\pgfqpoint{1.28cm}{1.249cm}}{\pgfqpoint{1.305cm}{1.274cm}}
\pgfpathcurveto{\pgfqpoint{1.331cm}{1.3cm}}{\pgfqpoint{1.345cm}{1.335cm}}{\pgfqpoint{1.345cm}{1.371cm}}
\pgfusepath{fill}
\begin{pgfscope}
\pgfsetdash{}{0cm}
\pgfsetlinewidth{0.818mm}
\pgfsetroundcap
\pgfsetmiterlimit{4.0}
\pgfpathmoveto{\pgfqpoint{0.682cm}{0.671cm}}
\pgfpathlineto{\pgfqpoint{0.682cm}{0.042cm}}
\pgfusepath{stroke}
\end{pgfscope}
\end{pgfscope}
\end{pgfscope}
\end{pgfscope}
\end{tikzpicture}}}X^{\!\resizebox{!}{.8em}{
\begin{tikzpicture}
\pgfpathmoveto{\pgfqpoint{0cm}{-0.035cm}}
\pgfpathlineto{\pgfqpoint{1.976cm}{-0.035cm}}
\pgfpathlineto{\pgfqpoint{1.976cm}{1.94cm}}
\pgfpathlineto{\pgfqpoint{0cm}{1.94cm}}
\pgfpathclose
\pgfusepath{clip}
\begin{pgfscope}
\begin{pgfscope}
\pgfpathmoveto{\pgfqpoint{0cm}{-0.035cm}}
\pgfpathlineto{\pgfqpoint{1.976cm}{-0.035cm}}
\pgfpathlineto{\pgfqpoint{1.976cm}{1.94cm}}
\pgfpathlineto{\pgfqpoint{0cm}{1.94cm}}
\pgfpathclose
\pgfusepath{clip}
\begin{pgfscope}
\begin{pgfscope}
\pgfsetdash{}{0cm}
\pgfsetlinewidth{0.818mm}
\pgfsetroundcap
\pgfsetroundjoin
\pgfsetmiterlimit{7.0}
\definecolor{eps2pgf_color}{gray}{0}\pgfsetstrokecolor{eps2pgf_color}\pgfsetfillcolor{eps2pgf_color}
\pgfpathmoveto{\pgfqpoint{0.117cm}{1.815cm}}
\pgfpathlineto{\pgfqpoint{0.682cm}{1.065cm}}
\pgfpathlineto{\pgfqpoint{1.246cm}{1.815cm}}
\pgfusepath{stroke}
\end{pgfscope}
\definecolor{eps2pgf_color}{gray}{0}\pgfsetstrokecolor{eps2pgf_color}\pgfsetfillcolor{eps2pgf_color}
\pgfpathmoveto{\pgfqpoint{0.273cm}{1.789cm}}
\pgfpathcurveto{\pgfqpoint{0.273cm}{1.825cm}}{\pgfqpoint{0.259cm}{1.86cm}}{\pgfqpoint{0.233cm}{1.886cm}}
\pgfpathcurveto{\pgfqpoint{0.207cm}{1.912cm}}{\pgfqpoint{0.173cm}{1.926cm}}{\pgfqpoint{0.137cm}{1.926cm}}
\pgfpathcurveto{\pgfqpoint{0.1cm}{1.926cm}}{\pgfqpoint{0.066cm}{1.912cm}}{\pgfqpoint{0.04cm}{1.886cm}}
\pgfpathcurveto{\pgfqpoint{0.014cm}{1.86cm}}{\pgfqpoint{0cm}{1.825cm}}{\pgfqpoint{0cm}{1.789cm}}
\pgfpathcurveto{\pgfqpoint{0cm}{1.753cm}}{\pgfqpoint{0.014cm}{1.718cm}}{\pgfqpoint{0.04cm}{1.692cm}}
\pgfpathcurveto{\pgfqpoint{0.066cm}{1.667cm}}{\pgfqpoint{0.1cm}{1.652cm}}{\pgfqpoint{0.137cm}{1.652cm}}
\pgfpathcurveto{\pgfqpoint{0.173cm}{1.652cm}}{\pgfqpoint{0.207cm}{1.667cm}}{\pgfqpoint{0.233cm}{1.692cm}}
\pgfpathcurveto{\pgfqpoint{0.259cm}{1.718cm}}{\pgfqpoint{0.273cm}{1.753cm}}{\pgfqpoint{0.273cm}{1.789cm}}
\pgfusepath{fill}
\begin{pgfscope}
\pgfsetdash{}{0cm}
\pgfsetlinewidth{0.818mm}
\pgfsetmiterlimit{7.0}
\pgfpathmoveto{\pgfqpoint{0.682cm}{1.065cm}}
\pgfpathlineto{\pgfqpoint{0.679cm}{1.812cm}}
\pgfusepath{stroke}
\end{pgfscope}
\pgfpathmoveto{\pgfqpoint{0.815cm}{1.793cm}}
\pgfpathcurveto{\pgfqpoint{0.815cm}{1.829cm}}{\pgfqpoint{0.801cm}{1.864cm}}{\pgfqpoint{0.775cm}{1.89cm}}
\pgfpathcurveto{\pgfqpoint{0.75cm}{1.915cm}}{\pgfqpoint{0.715cm}{1.93cm}}{\pgfqpoint{0.679cm}{1.93cm}}
\pgfpathcurveto{\pgfqpoint{0.643cm}{1.93cm}}{\pgfqpoint{0.608cm}{1.915cm}}{\pgfqpoint{0.582cm}{1.89cm}}
\pgfpathcurveto{\pgfqpoint{0.557cm}{1.864cm}}{\pgfqpoint{0.542cm}{1.829cm}}{\pgfqpoint{0.542cm}{1.793cm}}
\pgfpathcurveto{\pgfqpoint{0.542cm}{1.756cm}}{\pgfqpoint{0.557cm}{1.722cm}}{\pgfqpoint{0.582cm}{1.696cm}}
\pgfpathcurveto{\pgfqpoint{0.608cm}{1.67cm}}{\pgfqpoint{0.643cm}{1.656cm}}{\pgfqpoint{0.679cm}{1.656cm}}
\pgfpathcurveto{\pgfqpoint{0.715cm}{1.656cm}}{\pgfqpoint{0.75cm}{1.67cm}}{\pgfqpoint{0.775cm}{1.696cm}}
\pgfpathcurveto{\pgfqpoint{0.801cm}{1.722cm}}{\pgfqpoint{0.815cm}{1.756cm}}{\pgfqpoint{0.815cm}{1.793cm}}
\pgfusepath{fill}
\pgfpathmoveto{\pgfqpoint{1.345cm}{1.765cm}}
\pgfpathcurveto{\pgfqpoint{1.345cm}{1.801cm}}{\pgfqpoint{1.331cm}{1.836cm}}{\pgfqpoint{1.305cm}{1.862cm}}
\pgfpathcurveto{\pgfqpoint{1.28cm}{1.887cm}}{\pgfqpoint{1.245cm}{1.902cm}}{\pgfqpoint{1.209cm}{1.902cm}}
\pgfpathcurveto{\pgfqpoint{1.172cm}{1.902cm}}{\pgfqpoint{1.138cm}{1.887cm}}{\pgfqpoint{1.112cm}{1.862cm}}
\pgfpathcurveto{\pgfqpoint{1.087cm}{1.836cm}}{\pgfqpoint{1.072cm}{1.801cm}}{\pgfqpoint{1.072cm}{1.765cm}}
\pgfpathcurveto{\pgfqpoint{1.072cm}{1.728cm}}{\pgfqpoint{1.087cm}{1.694cm}}{\pgfqpoint{1.112cm}{1.668cm}}
\pgfpathcurveto{\pgfqpoint{1.138cm}{1.642cm}}{\pgfqpoint{1.172cm}{1.628cm}}{\pgfqpoint{1.209cm}{1.628cm}}
\pgfpathcurveto{\pgfqpoint{1.245cm}{1.628cm}}{\pgfqpoint{1.28cm}{1.642cm}}{\pgfqpoint{1.305cm}{1.668cm}}
\pgfpathcurveto{\pgfqpoint{1.331cm}{1.694cm}}{\pgfqpoint{1.345cm}{1.728cm}}{\pgfqpoint{1.345cm}{1.765cm}}
\pgfusepath{fill}
\begin{pgfscope}
\pgfsetdash{}{0cm}
\pgfsetlinewidth{0.818mm}
\pgfsetroundcap
\pgfsetroundjoin
\pgfsetmiterlimit{7.0}
\pgfpathmoveto{\pgfqpoint{0.682cm}{1.065cm}}
\pgfpathlineto{\pgfqpoint{1.246cm}{0.315cm}}
\pgfpathlineto{\pgfqpoint{1.811cm}{1.065cm}}
\pgfusepath{stroke}
\end{pgfscope}
\pgfpathmoveto{\pgfqpoint{1.948cm}{1.065cm}}
\pgfpathcurveto{\pgfqpoint{1.948cm}{1.101cm}}{\pgfqpoint{1.933cm}{1.136cm}}{\pgfqpoint{1.907cm}{1.162cm}}
\pgfpathcurveto{\pgfqpoint{1.882cm}{1.187cm}}{\pgfqpoint{1.847cm}{1.202cm}}{\pgfqpoint{1.811cm}{1.202cm}}
\pgfpathcurveto{\pgfqpoint{1.775cm}{1.202cm}}{\pgfqpoint{1.74cm}{1.187cm}}{\pgfqpoint{1.714cm}{1.162cm}}
\pgfpathcurveto{\pgfqpoint{1.689cm}{1.136cm}}{\pgfqpoint{1.674cm}{1.101cm}}{\pgfqpoint{1.674cm}{1.065cm}}
\pgfpathcurveto{\pgfqpoint{1.674cm}{1.029cm}}{\pgfqpoint{1.689cm}{0.994cm}}{\pgfqpoint{1.714cm}{0.968cm}}
\pgfpathcurveto{\pgfqpoint{1.74cm}{0.942cm}}{\pgfqpoint{1.775cm}{0.928cm}}{\pgfqpoint{1.811cm}{0.928cm}}
\pgfpathcurveto{\pgfqpoint{1.847cm}{0.928cm}}{\pgfqpoint{1.882cm}{0.942cm}}{\pgfqpoint{1.907cm}{0.968cm}}
\pgfpathcurveto{\pgfqpoint{1.933cm}{0.994cm}}{\pgfqpoint{1.948cm}{1.029cm}}{\pgfqpoint{1.948cm}{1.065cm}}
\pgfusepath{fill}
\begin{pgfscope}
\pgfsetdash{}{0cm}
\pgfsetlinewidth{0.818mm}
\pgfsetmiterlimit{4.0}
\pgfpathmoveto{\pgfqpoint{1.383cm}{0.178cm}}
\pgfpathcurveto{\pgfqpoint{1.383cm}{0.214cm}}{\pgfqpoint{1.369cm}{0.249cm}}{\pgfqpoint{1.343cm}{0.275cm}}
\pgfpathcurveto{\pgfqpoint{1.317cm}{0.3cm}}{\pgfqpoint{1.283cm}{0.315cm}}{\pgfqpoint{1.246cm}{0.315cm}}
\pgfpathcurveto{\pgfqpoint{1.21cm}{0.315cm}}{\pgfqpoint{1.175cm}{0.3cm}}{\pgfqpoint{1.15cm}{0.275cm}}
\pgfpathcurveto{\pgfqpoint{1.124cm}{0.249cm}}{\pgfqpoint{1.11cm}{0.214cm}}{\pgfqpoint{1.11cm}{0.178cm}}
\pgfpathcurveto{\pgfqpoint{1.11cm}{0.141cm}}{\pgfqpoint{1.124cm}{0.107cm}}{\pgfqpoint{1.15cm}{0.081cm}}
\pgfpathcurveto{\pgfqpoint{1.175cm}{0.055cm}}{\pgfqpoint{1.21cm}{0.041cm}}{\pgfqpoint{1.246cm}{0.041cm}}
\pgfpathcurveto{\pgfqpoint{1.283cm}{0.041cm}}{\pgfqpoint{1.317cm}{0.055cm}}{\pgfqpoint{1.343cm}{0.081cm}}
\pgfpathcurveto{\pgfqpoint{1.369cm}{0.107cm}}{\pgfqpoint{1.383cm}{0.141cm}}{\pgfqpoint{1.383cm}{0.178cm}}
\pgfusepath{stroke}
\end{pgfscope}
\end{pgfscope}
\end{pgfscope}
\end{pgfscope}
\end{tikzpicture}}}}\\
&\quad+\rmb{6\mathrm{com}(X^{\!\resizebox{0.6em}{!}{
\begin{tikzpicture}
\pgfpathmoveto{\pgfqpoint{0cm}{-0.035cm}}
\pgfpathlineto{\pgfqpoint{1.376cm}{-0.035cm}}
\pgfpathlineto{\pgfqpoint{1.376cm}{1.552cm}}
\pgfpathlineto{\pgfqpoint{0cm}{1.552cm}}
\pgfpathclose
\pgfusepath{clip}
\begin{pgfscope}
\begin{pgfscope}
\pgfpathmoveto{\pgfqpoint{0cm}{-0.035cm}}
\pgfpathlineto{\pgfqpoint{1.376cm}{-0.035cm}}
\pgfpathlineto{\pgfqpoint{1.376cm}{1.552cm}}
\pgfpathlineto{\pgfqpoint{0cm}{1.552cm}}
\pgfpathclose
\pgfusepath{clip}
\begin{pgfscope}
\begin{pgfscope}
\pgfsetdash{}{0cm}
\pgfsetlinewidth{0.818mm}
\pgfsetroundcap
\pgfsetroundjoin
\pgfsetmiterlimit{7.0}
\definecolor{eps2pgf_color}{gray}{0}\pgfsetstrokecolor{eps2pgf_color}\pgfsetfillcolor{eps2pgf_color}
\pgfpathmoveto{\pgfqpoint{0.117cm}{1.421cm}}
\pgfpathlineto{\pgfqpoint{0.682cm}{0.671cm}}
\pgfpathlineto{\pgfqpoint{1.246cm}{1.421cm}}
\pgfusepath{stroke}
\end{pgfscope}
\definecolor{eps2pgf_color}{gray}{0}\pgfsetstrokecolor{eps2pgf_color}\pgfsetfillcolor{eps2pgf_color}
\pgfpathmoveto{\pgfqpoint{0.273cm}{1.395cm}}
\pgfpathcurveto{\pgfqpoint{0.273cm}{1.432cm}}{\pgfqpoint{0.259cm}{1.467cm}}{\pgfqpoint{0.233cm}{1.492cm}}
\pgfpathcurveto{\pgfqpoint{0.207cm}{1.518cm}}{\pgfqpoint{0.173cm}{1.532cm}}{\pgfqpoint{0.137cm}{1.532cm}}
\pgfpathcurveto{\pgfqpoint{0.1cm}{1.532cm}}{\pgfqpoint{0.066cm}{1.518cm}}{\pgfqpoint{0.04cm}{1.492cm}}
\pgfpathcurveto{\pgfqpoint{0.014cm}{1.467cm}}{\pgfqpoint{0cm}{1.432cm}}{\pgfqpoint{0cm}{1.395cm}}
\pgfpathcurveto{\pgfqpoint{0cm}{1.359cm}}{\pgfqpoint{0.014cm}{1.324cm}}{\pgfqpoint{0.04cm}{1.299cm}}
\pgfpathcurveto{\pgfqpoint{0.066cm}{1.273cm}}{\pgfqpoint{0.1cm}{1.258cm}}{\pgfqpoint{0.137cm}{1.258cm}}
\pgfpathcurveto{\pgfqpoint{0.173cm}{1.258cm}}{\pgfqpoint{0.207cm}{1.273cm}}{\pgfqpoint{0.233cm}{1.299cm}}
\pgfpathcurveto{\pgfqpoint{0.259cm}{1.324cm}}{\pgfqpoint{0.273cm}{1.359cm}}{\pgfqpoint{0.273cm}{1.395cm}}
\pgfusepath{fill}
\begin{pgfscope}
\pgfsetdash{}{0cm}
\pgfsetlinewidth{0.818mm}
\pgfsetmiterlimit{7.0}
\pgfpathmoveto{\pgfqpoint{0.682cm}{0.671cm}}
\pgfpathlineto{\pgfqpoint{0.679cm}{1.418cm}}
\pgfusepath{stroke}
\end{pgfscope}
\pgfpathmoveto{\pgfqpoint{0.815cm}{1.399cm}}
\pgfpathcurveto{\pgfqpoint{0.815cm}{1.435cm}}{\pgfqpoint{0.801cm}{1.47cm}}{\pgfqpoint{0.775cm}{1.496cm}}
\pgfpathcurveto{\pgfqpoint{0.75cm}{1.521cm}}{\pgfqpoint{0.715cm}{1.536cm}}{\pgfqpoint{0.679cm}{1.536cm}}
\pgfpathcurveto{\pgfqpoint{0.643cm}{1.536cm}}{\pgfqpoint{0.608cm}{1.521cm}}{\pgfqpoint{0.582cm}{1.496cm}}
\pgfpathcurveto{\pgfqpoint{0.557cm}{1.47cm}}{\pgfqpoint{0.542cm}{1.435cm}}{\pgfqpoint{0.542cm}{1.399cm}}
\pgfpathcurveto{\pgfqpoint{0.542cm}{1.363cm}}{\pgfqpoint{0.557cm}{1.328cm}}{\pgfqpoint{0.582cm}{1.302cm}}
\pgfpathcurveto{\pgfqpoint{0.608cm}{1.276cm}}{\pgfqpoint{0.643cm}{1.262cm}}{\pgfqpoint{0.679cm}{1.262cm}}
\pgfpathcurveto{\pgfqpoint{0.715cm}{1.262cm}}{\pgfqpoint{0.75cm}{1.276cm}}{\pgfqpoint{0.775cm}{1.302cm}}
\pgfpathcurveto{\pgfqpoint{0.801cm}{1.328cm}}{\pgfqpoint{0.815cm}{1.363cm}}{\pgfqpoint{0.815cm}{1.399cm}}
\pgfusepath{fill}
\pgfpathmoveto{\pgfqpoint{1.345cm}{1.371cm}}
\pgfpathcurveto{\pgfqpoint{1.345cm}{1.408cm}}{\pgfqpoint{1.331cm}{1.442cm}}{\pgfqpoint{1.305cm}{1.468cm}}
\pgfpathcurveto{\pgfqpoint{1.28cm}{1.494cm}}{\pgfqpoint{1.245cm}{1.508cm}}{\pgfqpoint{1.209cm}{1.508cm}}
\pgfpathcurveto{\pgfqpoint{1.172cm}{1.508cm}}{\pgfqpoint{1.138cm}{1.494cm}}{\pgfqpoint{1.112cm}{1.468cm}}
\pgfpathcurveto{\pgfqpoint{1.087cm}{1.442cm}}{\pgfqpoint{1.072cm}{1.408cm}}{\pgfqpoint{1.072cm}{1.371cm}}
\pgfpathcurveto{\pgfqpoint{1.072cm}{1.335cm}}{\pgfqpoint{1.087cm}{1.3cm}}{\pgfqpoint{1.112cm}{1.274cm}}
\pgfpathcurveto{\pgfqpoint{1.138cm}{1.249cm}}{\pgfqpoint{1.172cm}{1.234cm}}{\pgfqpoint{1.209cm}{1.234cm}}
\pgfpathcurveto{\pgfqpoint{1.245cm}{1.234cm}}{\pgfqpoint{1.28cm}{1.249cm}}{\pgfqpoint{1.305cm}{1.274cm}}
\pgfpathcurveto{\pgfqpoint{1.331cm}{1.3cm}}{\pgfqpoint{1.345cm}{1.335cm}}{\pgfqpoint{1.345cm}{1.371cm}}
\pgfusepath{fill}
\begin{pgfscope}
\pgfsetdash{}{0cm}
\pgfsetlinewidth{0.818mm}
\pgfsetroundcap
\pgfsetmiterlimit{4.0}
\pgfpathmoveto{\pgfqpoint{0.682cm}{0.671cm}}
\pgfpathlineto{\pgfqpoint{0.682cm}{0.042cm}}
\pgfusepath{stroke}
\end{pgfscope}
\end{pgfscope}
\end{pgfscope}
\end{pgfscope}
\end{tikzpicture}}},X^{\!\resizebox{0.6em}{!}{
\begin{tikzpicture}
\pgfpathmoveto{\pgfqpoint{0cm}{-0.035cm}}
\pgfpathlineto{\pgfqpoint{1.376cm}{-0.035cm}}
\pgfpathlineto{\pgfqpoint{1.376cm}{1.552cm}}
\pgfpathlineto{\pgfqpoint{0cm}{1.552cm}}
\pgfpathclose
\pgfusepath{clip}
\begin{pgfscope}
\begin{pgfscope}
\pgfpathmoveto{\pgfqpoint{0cm}{-0.035cm}}
\pgfpathlineto{\pgfqpoint{1.376cm}{-0.035cm}}
\pgfpathlineto{\pgfqpoint{1.376cm}{1.552cm}}
\pgfpathlineto{\pgfqpoint{0cm}{1.552cm}}
\pgfpathclose
\pgfusepath{clip}
\begin{pgfscope}
\begin{pgfscope}
\pgfsetdash{}{0cm}
\pgfsetlinewidth{0.818mm}
\pgfsetroundcap
\pgfsetroundjoin
\pgfsetmiterlimit{7.0}
\definecolor{eps2pgf_color}{gray}{0}\pgfsetstrokecolor{eps2pgf_color}\pgfsetfillcolor{eps2pgf_color}
\pgfpathmoveto{\pgfqpoint{0.117cm}{1.421cm}}
\pgfpathlineto{\pgfqpoint{0.682cm}{0.671cm}}
\pgfpathlineto{\pgfqpoint{1.246cm}{1.421cm}}
\pgfusepath{stroke}
\end{pgfscope}
\definecolor{eps2pgf_color}{gray}{0}\pgfsetstrokecolor{eps2pgf_color}\pgfsetfillcolor{eps2pgf_color}
\pgfpathmoveto{\pgfqpoint{0.273cm}{1.395cm}}
\pgfpathcurveto{\pgfqpoint{0.273cm}{1.432cm}}{\pgfqpoint{0.259cm}{1.467cm}}{\pgfqpoint{0.233cm}{1.492cm}}
\pgfpathcurveto{\pgfqpoint{0.207cm}{1.518cm}}{\pgfqpoint{0.173cm}{1.532cm}}{\pgfqpoint{0.137cm}{1.532cm}}
\pgfpathcurveto{\pgfqpoint{0.1cm}{1.532cm}}{\pgfqpoint{0.066cm}{1.518cm}}{\pgfqpoint{0.04cm}{1.492cm}}
\pgfpathcurveto{\pgfqpoint{0.014cm}{1.467cm}}{\pgfqpoint{0cm}{1.432cm}}{\pgfqpoint{0cm}{1.395cm}}
\pgfpathcurveto{\pgfqpoint{0cm}{1.359cm}}{\pgfqpoint{0.014cm}{1.324cm}}{\pgfqpoint{0.04cm}{1.299cm}}
\pgfpathcurveto{\pgfqpoint{0.066cm}{1.273cm}}{\pgfqpoint{0.1cm}{1.258cm}}{\pgfqpoint{0.137cm}{1.258cm}}
\pgfpathcurveto{\pgfqpoint{0.173cm}{1.258cm}}{\pgfqpoint{0.207cm}{1.273cm}}{\pgfqpoint{0.233cm}{1.299cm}}
\pgfpathcurveto{\pgfqpoint{0.259cm}{1.324cm}}{\pgfqpoint{0.273cm}{1.359cm}}{\pgfqpoint{0.273cm}{1.395cm}}
\pgfusepath{fill}
\begin{pgfscope}
\pgfsetdash{}{0cm}
\pgfsetlinewidth{0.818mm}
\pgfsetmiterlimit{7.0}
\pgfpathmoveto{\pgfqpoint{0.682cm}{0.671cm}}
\pgfpathlineto{\pgfqpoint{0.679cm}{1.418cm}}
\pgfusepath{stroke}
\end{pgfscope}
\pgfpathmoveto{\pgfqpoint{0.815cm}{1.399cm}}
\pgfpathcurveto{\pgfqpoint{0.815cm}{1.435cm}}{\pgfqpoint{0.801cm}{1.47cm}}{\pgfqpoint{0.775cm}{1.496cm}}
\pgfpathcurveto{\pgfqpoint{0.75cm}{1.521cm}}{\pgfqpoint{0.715cm}{1.536cm}}{\pgfqpoint{0.679cm}{1.536cm}}
\pgfpathcurveto{\pgfqpoint{0.643cm}{1.536cm}}{\pgfqpoint{0.608cm}{1.521cm}}{\pgfqpoint{0.582cm}{1.496cm}}
\pgfpathcurveto{\pgfqpoint{0.557cm}{1.47cm}}{\pgfqpoint{0.542cm}{1.435cm}}{\pgfqpoint{0.542cm}{1.399cm}}
\pgfpathcurveto{\pgfqpoint{0.542cm}{1.363cm}}{\pgfqpoint{0.557cm}{1.328cm}}{\pgfqpoint{0.582cm}{1.302cm}}
\pgfpathcurveto{\pgfqpoint{0.608cm}{1.276cm}}{\pgfqpoint{0.643cm}{1.262cm}}{\pgfqpoint{0.679cm}{1.262cm}}
\pgfpathcurveto{\pgfqpoint{0.715cm}{1.262cm}}{\pgfqpoint{0.75cm}{1.276cm}}{\pgfqpoint{0.775cm}{1.302cm}}
\pgfpathcurveto{\pgfqpoint{0.801cm}{1.328cm}}{\pgfqpoint{0.815cm}{1.363cm}}{\pgfqpoint{0.815cm}{1.399cm}}
\pgfusepath{fill}
\pgfpathmoveto{\pgfqpoint{1.345cm}{1.371cm}}
\pgfpathcurveto{\pgfqpoint{1.345cm}{1.408cm}}{\pgfqpoint{1.331cm}{1.442cm}}{\pgfqpoint{1.305cm}{1.468cm}}
\pgfpathcurveto{\pgfqpoint{1.28cm}{1.494cm}}{\pgfqpoint{1.245cm}{1.508cm}}{\pgfqpoint{1.209cm}{1.508cm}}
\pgfpathcurveto{\pgfqpoint{1.172cm}{1.508cm}}{\pgfqpoint{1.138cm}{1.494cm}}{\pgfqpoint{1.112cm}{1.468cm}}
\pgfpathcurveto{\pgfqpoint{1.087cm}{1.442cm}}{\pgfqpoint{1.072cm}{1.408cm}}{\pgfqpoint{1.072cm}{1.371cm}}
\pgfpathcurveto{\pgfqpoint{1.072cm}{1.335cm}}{\pgfqpoint{1.087cm}{1.3cm}}{\pgfqpoint{1.112cm}{1.274cm}}
\pgfpathcurveto{\pgfqpoint{1.138cm}{1.249cm}}{\pgfqpoint{1.172cm}{1.234cm}}{\pgfqpoint{1.209cm}{1.234cm}}
\pgfpathcurveto{\pgfqpoint{1.245cm}{1.234cm}}{\pgfqpoint{1.28cm}{1.249cm}}{\pgfqpoint{1.305cm}{1.274cm}}
\pgfpathcurveto{\pgfqpoint{1.331cm}{1.3cm}}{\pgfqpoint{1.345cm}{1.335cm}}{\pgfqpoint{1.345cm}{1.371cm}}
\pgfusepath{fill}
\begin{pgfscope}
\pgfsetdash{}{0cm}
\pgfsetlinewidth{0.818mm}
\pgfsetroundcap
\pgfsetmiterlimit{4.0}
\pgfpathmoveto{\pgfqpoint{0.682cm}{0.671cm}}
\pgfpathlineto{\pgfqpoint{0.682cm}{0.042cm}}
\pgfusepath{stroke}
\end{pgfscope}
\end{pgfscope}
\end{pgfscope}
\end{pgfscope}
\end{tikzpicture}}},X)}+\rmb{3X\circ (X^{\!\resizebox{0.6em}{!}{
\begin{tikzpicture}
\pgfpathmoveto{\pgfqpoint{0cm}{-0.035cm}}
\pgfpathlineto{\pgfqpoint{1.376cm}{-0.035cm}}
\pgfpathlineto{\pgfqpoint{1.376cm}{1.552cm}}
\pgfpathlineto{\pgfqpoint{0cm}{1.552cm}}
\pgfpathclose
\pgfusepath{clip}
\begin{pgfscope}
\begin{pgfscope}
\pgfpathmoveto{\pgfqpoint{0cm}{-0.035cm}}
\pgfpathlineto{\pgfqpoint{1.376cm}{-0.035cm}}
\pgfpathlineto{\pgfqpoint{1.376cm}{1.552cm}}
\pgfpathlineto{\pgfqpoint{0cm}{1.552cm}}
\pgfpathclose
\pgfusepath{clip}
\begin{pgfscope}
\begin{pgfscope}
\pgfsetdash{}{0cm}
\pgfsetlinewidth{0.818mm}
\pgfsetroundcap
\pgfsetroundjoin
\pgfsetmiterlimit{7.0}
\definecolor{eps2pgf_color}{gray}{0}\pgfsetstrokecolor{eps2pgf_color}\pgfsetfillcolor{eps2pgf_color}
\pgfpathmoveto{\pgfqpoint{0.117cm}{1.421cm}}
\pgfpathlineto{\pgfqpoint{0.682cm}{0.671cm}}
\pgfpathlineto{\pgfqpoint{1.246cm}{1.421cm}}
\pgfusepath{stroke}
\end{pgfscope}
\definecolor{eps2pgf_color}{gray}{0}\pgfsetstrokecolor{eps2pgf_color}\pgfsetfillcolor{eps2pgf_color}
\pgfpathmoveto{\pgfqpoint{0.273cm}{1.395cm}}
\pgfpathcurveto{\pgfqpoint{0.273cm}{1.432cm}}{\pgfqpoint{0.259cm}{1.467cm}}{\pgfqpoint{0.233cm}{1.492cm}}
\pgfpathcurveto{\pgfqpoint{0.207cm}{1.518cm}}{\pgfqpoint{0.173cm}{1.532cm}}{\pgfqpoint{0.137cm}{1.532cm}}
\pgfpathcurveto{\pgfqpoint{0.1cm}{1.532cm}}{\pgfqpoint{0.066cm}{1.518cm}}{\pgfqpoint{0.04cm}{1.492cm}}
\pgfpathcurveto{\pgfqpoint{0.014cm}{1.467cm}}{\pgfqpoint{0cm}{1.432cm}}{\pgfqpoint{0cm}{1.395cm}}
\pgfpathcurveto{\pgfqpoint{0cm}{1.359cm}}{\pgfqpoint{0.014cm}{1.324cm}}{\pgfqpoint{0.04cm}{1.299cm}}
\pgfpathcurveto{\pgfqpoint{0.066cm}{1.273cm}}{\pgfqpoint{0.1cm}{1.258cm}}{\pgfqpoint{0.137cm}{1.258cm}}
\pgfpathcurveto{\pgfqpoint{0.173cm}{1.258cm}}{\pgfqpoint{0.207cm}{1.273cm}}{\pgfqpoint{0.233cm}{1.299cm}}
\pgfpathcurveto{\pgfqpoint{0.259cm}{1.324cm}}{\pgfqpoint{0.273cm}{1.359cm}}{\pgfqpoint{0.273cm}{1.395cm}}
\pgfusepath{fill}
\begin{pgfscope}
\pgfsetdash{}{0cm}
\pgfsetlinewidth{0.818mm}
\pgfsetmiterlimit{7.0}
\pgfpathmoveto{\pgfqpoint{0.682cm}{0.671cm}}
\pgfpathlineto{\pgfqpoint{0.679cm}{1.418cm}}
\pgfusepath{stroke}
\end{pgfscope}
\pgfpathmoveto{\pgfqpoint{0.815cm}{1.399cm}}
\pgfpathcurveto{\pgfqpoint{0.815cm}{1.435cm}}{\pgfqpoint{0.801cm}{1.47cm}}{\pgfqpoint{0.775cm}{1.496cm}}
\pgfpathcurveto{\pgfqpoint{0.75cm}{1.521cm}}{\pgfqpoint{0.715cm}{1.536cm}}{\pgfqpoint{0.679cm}{1.536cm}}
\pgfpathcurveto{\pgfqpoint{0.643cm}{1.536cm}}{\pgfqpoint{0.608cm}{1.521cm}}{\pgfqpoint{0.582cm}{1.496cm}}
\pgfpathcurveto{\pgfqpoint{0.557cm}{1.47cm}}{\pgfqpoint{0.542cm}{1.435cm}}{\pgfqpoint{0.542cm}{1.399cm}}
\pgfpathcurveto{\pgfqpoint{0.542cm}{1.363cm}}{\pgfqpoint{0.557cm}{1.328cm}}{\pgfqpoint{0.582cm}{1.302cm}}
\pgfpathcurveto{\pgfqpoint{0.608cm}{1.276cm}}{\pgfqpoint{0.643cm}{1.262cm}}{\pgfqpoint{0.679cm}{1.262cm}}
\pgfpathcurveto{\pgfqpoint{0.715cm}{1.262cm}}{\pgfqpoint{0.75cm}{1.276cm}}{\pgfqpoint{0.775cm}{1.302cm}}
\pgfpathcurveto{\pgfqpoint{0.801cm}{1.328cm}}{\pgfqpoint{0.815cm}{1.363cm}}{\pgfqpoint{0.815cm}{1.399cm}}
\pgfusepath{fill}
\pgfpathmoveto{\pgfqpoint{1.345cm}{1.371cm}}
\pgfpathcurveto{\pgfqpoint{1.345cm}{1.408cm}}{\pgfqpoint{1.331cm}{1.442cm}}{\pgfqpoint{1.305cm}{1.468cm}}
\pgfpathcurveto{\pgfqpoint{1.28cm}{1.494cm}}{\pgfqpoint{1.245cm}{1.508cm}}{\pgfqpoint{1.209cm}{1.508cm}}
\pgfpathcurveto{\pgfqpoint{1.172cm}{1.508cm}}{\pgfqpoint{1.138cm}{1.494cm}}{\pgfqpoint{1.112cm}{1.468cm}}
\pgfpathcurveto{\pgfqpoint{1.087cm}{1.442cm}}{\pgfqpoint{1.072cm}{1.408cm}}{\pgfqpoint{1.072cm}{1.371cm}}
\pgfpathcurveto{\pgfqpoint{1.072cm}{1.335cm}}{\pgfqpoint{1.087cm}{1.3cm}}{\pgfqpoint{1.112cm}{1.274cm}}
\pgfpathcurveto{\pgfqpoint{1.138cm}{1.249cm}}{\pgfqpoint{1.172cm}{1.234cm}}{\pgfqpoint{1.209cm}{1.234cm}}
\pgfpathcurveto{\pgfqpoint{1.245cm}{1.234cm}}{\pgfqpoint{1.28cm}{1.249cm}}{\pgfqpoint{1.305cm}{1.274cm}}
\pgfpathcurveto{\pgfqpoint{1.331cm}{1.3cm}}{\pgfqpoint{1.345cm}{1.335cm}}{\pgfqpoint{1.345cm}{1.371cm}}
\pgfusepath{fill}
\begin{pgfscope}
\pgfsetdash{}{0cm}
\pgfsetlinewidth{0.818mm}
\pgfsetroundcap
\pgfsetmiterlimit{4.0}
\pgfpathmoveto{\pgfqpoint{0.682cm}{0.671cm}}
\pgfpathlineto{\pgfqpoint{0.682cm}{0.042cm}}
\pgfusepath{stroke}
\end{pgfscope}
\end{pgfscope}
\end{pgfscope}
\end{pgfscope}
\end{tikzpicture}}}\circ X^{\!\resizebox{0.6em}{!}{
\begin{tikzpicture}
\pgfpathmoveto{\pgfqpoint{0cm}{-0.035cm}}
\pgfpathlineto{\pgfqpoint{1.376cm}{-0.035cm}}
\pgfpathlineto{\pgfqpoint{1.376cm}{1.552cm}}
\pgfpathlineto{\pgfqpoint{0cm}{1.552cm}}
\pgfpathclose
\pgfusepath{clip}
\begin{pgfscope}
\begin{pgfscope}
\pgfpathmoveto{\pgfqpoint{0cm}{-0.035cm}}
\pgfpathlineto{\pgfqpoint{1.376cm}{-0.035cm}}
\pgfpathlineto{\pgfqpoint{1.376cm}{1.552cm}}
\pgfpathlineto{\pgfqpoint{0cm}{1.552cm}}
\pgfpathclose
\pgfusepath{clip}
\begin{pgfscope}
\begin{pgfscope}
\pgfsetdash{}{0cm}
\pgfsetlinewidth{0.818mm}
\pgfsetroundcap
\pgfsetroundjoin
\pgfsetmiterlimit{7.0}
\definecolor{eps2pgf_color}{gray}{0}\pgfsetstrokecolor{eps2pgf_color}\pgfsetfillcolor{eps2pgf_color}
\pgfpathmoveto{\pgfqpoint{0.117cm}{1.421cm}}
\pgfpathlineto{\pgfqpoint{0.682cm}{0.671cm}}
\pgfpathlineto{\pgfqpoint{1.246cm}{1.421cm}}
\pgfusepath{stroke}
\end{pgfscope}
\definecolor{eps2pgf_color}{gray}{0}\pgfsetstrokecolor{eps2pgf_color}\pgfsetfillcolor{eps2pgf_color}
\pgfpathmoveto{\pgfqpoint{0.273cm}{1.395cm}}
\pgfpathcurveto{\pgfqpoint{0.273cm}{1.432cm}}{\pgfqpoint{0.259cm}{1.467cm}}{\pgfqpoint{0.233cm}{1.492cm}}
\pgfpathcurveto{\pgfqpoint{0.207cm}{1.518cm}}{\pgfqpoint{0.173cm}{1.532cm}}{\pgfqpoint{0.137cm}{1.532cm}}
\pgfpathcurveto{\pgfqpoint{0.1cm}{1.532cm}}{\pgfqpoint{0.066cm}{1.518cm}}{\pgfqpoint{0.04cm}{1.492cm}}
\pgfpathcurveto{\pgfqpoint{0.014cm}{1.467cm}}{\pgfqpoint{0cm}{1.432cm}}{\pgfqpoint{0cm}{1.395cm}}
\pgfpathcurveto{\pgfqpoint{0cm}{1.359cm}}{\pgfqpoint{0.014cm}{1.324cm}}{\pgfqpoint{0.04cm}{1.299cm}}
\pgfpathcurveto{\pgfqpoint{0.066cm}{1.273cm}}{\pgfqpoint{0.1cm}{1.258cm}}{\pgfqpoint{0.137cm}{1.258cm}}
\pgfpathcurveto{\pgfqpoint{0.173cm}{1.258cm}}{\pgfqpoint{0.207cm}{1.273cm}}{\pgfqpoint{0.233cm}{1.299cm}}
\pgfpathcurveto{\pgfqpoint{0.259cm}{1.324cm}}{\pgfqpoint{0.273cm}{1.359cm}}{\pgfqpoint{0.273cm}{1.395cm}}
\pgfusepath{fill}
\begin{pgfscope}
\pgfsetdash{}{0cm}
\pgfsetlinewidth{0.818mm}
\pgfsetmiterlimit{7.0}
\pgfpathmoveto{\pgfqpoint{0.682cm}{0.671cm}}
\pgfpathlineto{\pgfqpoint{0.679cm}{1.418cm}}
\pgfusepath{stroke}
\end{pgfscope}
\pgfpathmoveto{\pgfqpoint{0.815cm}{1.399cm}}
\pgfpathcurveto{\pgfqpoint{0.815cm}{1.435cm}}{\pgfqpoint{0.801cm}{1.47cm}}{\pgfqpoint{0.775cm}{1.496cm}}
\pgfpathcurveto{\pgfqpoint{0.75cm}{1.521cm}}{\pgfqpoint{0.715cm}{1.536cm}}{\pgfqpoint{0.679cm}{1.536cm}}
\pgfpathcurveto{\pgfqpoint{0.643cm}{1.536cm}}{\pgfqpoint{0.608cm}{1.521cm}}{\pgfqpoint{0.582cm}{1.496cm}}
\pgfpathcurveto{\pgfqpoint{0.557cm}{1.47cm}}{\pgfqpoint{0.542cm}{1.435cm}}{\pgfqpoint{0.542cm}{1.399cm}}
\pgfpathcurveto{\pgfqpoint{0.542cm}{1.363cm}}{\pgfqpoint{0.557cm}{1.328cm}}{\pgfqpoint{0.582cm}{1.302cm}}
\pgfpathcurveto{\pgfqpoint{0.608cm}{1.276cm}}{\pgfqpoint{0.643cm}{1.262cm}}{\pgfqpoint{0.679cm}{1.262cm}}
\pgfpathcurveto{\pgfqpoint{0.715cm}{1.262cm}}{\pgfqpoint{0.75cm}{1.276cm}}{\pgfqpoint{0.775cm}{1.302cm}}
\pgfpathcurveto{\pgfqpoint{0.801cm}{1.328cm}}{\pgfqpoint{0.815cm}{1.363cm}}{\pgfqpoint{0.815cm}{1.399cm}}
\pgfusepath{fill}
\pgfpathmoveto{\pgfqpoint{1.345cm}{1.371cm}}
\pgfpathcurveto{\pgfqpoint{1.345cm}{1.408cm}}{\pgfqpoint{1.331cm}{1.442cm}}{\pgfqpoint{1.305cm}{1.468cm}}
\pgfpathcurveto{\pgfqpoint{1.28cm}{1.494cm}}{\pgfqpoint{1.245cm}{1.508cm}}{\pgfqpoint{1.209cm}{1.508cm}}
\pgfpathcurveto{\pgfqpoint{1.172cm}{1.508cm}}{\pgfqpoint{1.138cm}{1.494cm}}{\pgfqpoint{1.112cm}{1.468cm}}
\pgfpathcurveto{\pgfqpoint{1.087cm}{1.442cm}}{\pgfqpoint{1.072cm}{1.408cm}}{\pgfqpoint{1.072cm}{1.371cm}}
\pgfpathcurveto{\pgfqpoint{1.072cm}{1.335cm}}{\pgfqpoint{1.087cm}{1.3cm}}{\pgfqpoint{1.112cm}{1.274cm}}
\pgfpathcurveto{\pgfqpoint{1.138cm}{1.249cm}}{\pgfqpoint{1.172cm}{1.234cm}}{\pgfqpoint{1.209cm}{1.234cm}}
\pgfpathcurveto{\pgfqpoint{1.245cm}{1.234cm}}{\pgfqpoint{1.28cm}{1.249cm}}{\pgfqpoint{1.305cm}{1.274cm}}
\pgfpathcurveto{\pgfqpoint{1.331cm}{1.3cm}}{\pgfqpoint{1.345cm}{1.335cm}}{\pgfqpoint{1.345cm}{1.371cm}}
\pgfusepath{fill}
\begin{pgfscope}
\pgfsetdash{}{0cm}
\pgfsetlinewidth{0.818mm}
\pgfsetroundcap
\pgfsetmiterlimit{4.0}
\pgfpathmoveto{\pgfqpoint{0.682cm}{0.671cm}}
\pgfpathlineto{\pgfqpoint{0.682cm}{0.042cm}}
\pgfusepath{stroke}
\end{pgfscope}
\end{pgfscope}
\end{pgfscope}
\end{pgfscope}
\end{tikzpicture}}})}\\
&\quad-\rmg{6X\succ(X^{\!\resizebox{0.6em}{!}{
\begin{tikzpicture}
\pgfpathmoveto{\pgfqpoint{0cm}{-0.035cm}}
\pgfpathlineto{\pgfqpoint{1.376cm}{-0.035cm}}
\pgfpathlineto{\pgfqpoint{1.376cm}{1.552cm}}
\pgfpathlineto{\pgfqpoint{0cm}{1.552cm}}
\pgfpathclose
\pgfusepath{clip}
\begin{pgfscope}
\begin{pgfscope}
\pgfpathmoveto{\pgfqpoint{0cm}{-0.035cm}}
\pgfpathlineto{\pgfqpoint{1.376cm}{-0.035cm}}
\pgfpathlineto{\pgfqpoint{1.376cm}{1.552cm}}
\pgfpathlineto{\pgfqpoint{0cm}{1.552cm}}
\pgfpathclose
\pgfusepath{clip}
\begin{pgfscope}
\begin{pgfscope}
\pgfsetdash{}{0cm}
\pgfsetlinewidth{0.818mm}
\pgfsetroundcap
\pgfsetroundjoin
\pgfsetmiterlimit{7.0}
\definecolor{eps2pgf_color}{gray}{0}\pgfsetstrokecolor{eps2pgf_color}\pgfsetfillcolor{eps2pgf_color}
\pgfpathmoveto{\pgfqpoint{0.117cm}{1.421cm}}
\pgfpathlineto{\pgfqpoint{0.682cm}{0.671cm}}
\pgfpathlineto{\pgfqpoint{1.246cm}{1.421cm}}
\pgfusepath{stroke}
\end{pgfscope}
\definecolor{eps2pgf_color}{gray}{0}\pgfsetstrokecolor{eps2pgf_color}\pgfsetfillcolor{eps2pgf_color}
\pgfpathmoveto{\pgfqpoint{0.273cm}{1.395cm}}
\pgfpathcurveto{\pgfqpoint{0.273cm}{1.432cm}}{\pgfqpoint{0.259cm}{1.467cm}}{\pgfqpoint{0.233cm}{1.492cm}}
\pgfpathcurveto{\pgfqpoint{0.207cm}{1.518cm}}{\pgfqpoint{0.173cm}{1.532cm}}{\pgfqpoint{0.137cm}{1.532cm}}
\pgfpathcurveto{\pgfqpoint{0.1cm}{1.532cm}}{\pgfqpoint{0.066cm}{1.518cm}}{\pgfqpoint{0.04cm}{1.492cm}}
\pgfpathcurveto{\pgfqpoint{0.014cm}{1.467cm}}{\pgfqpoint{0cm}{1.432cm}}{\pgfqpoint{0cm}{1.395cm}}
\pgfpathcurveto{\pgfqpoint{0cm}{1.359cm}}{\pgfqpoint{0.014cm}{1.324cm}}{\pgfqpoint{0.04cm}{1.299cm}}
\pgfpathcurveto{\pgfqpoint{0.066cm}{1.273cm}}{\pgfqpoint{0.1cm}{1.258cm}}{\pgfqpoint{0.137cm}{1.258cm}}
\pgfpathcurveto{\pgfqpoint{0.173cm}{1.258cm}}{\pgfqpoint{0.207cm}{1.273cm}}{\pgfqpoint{0.233cm}{1.299cm}}
\pgfpathcurveto{\pgfqpoint{0.259cm}{1.324cm}}{\pgfqpoint{0.273cm}{1.359cm}}{\pgfqpoint{0.273cm}{1.395cm}}
\pgfusepath{fill}
\begin{pgfscope}
\pgfsetdash{}{0cm}
\pgfsetlinewidth{0.818mm}
\pgfsetmiterlimit{7.0}
\pgfpathmoveto{\pgfqpoint{0.682cm}{0.671cm}}
\pgfpathlineto{\pgfqpoint{0.679cm}{1.418cm}}
\pgfusepath{stroke}
\end{pgfscope}
\pgfpathmoveto{\pgfqpoint{0.815cm}{1.399cm}}
\pgfpathcurveto{\pgfqpoint{0.815cm}{1.435cm}}{\pgfqpoint{0.801cm}{1.47cm}}{\pgfqpoint{0.775cm}{1.496cm}}
\pgfpathcurveto{\pgfqpoint{0.75cm}{1.521cm}}{\pgfqpoint{0.715cm}{1.536cm}}{\pgfqpoint{0.679cm}{1.536cm}}
\pgfpathcurveto{\pgfqpoint{0.643cm}{1.536cm}}{\pgfqpoint{0.608cm}{1.521cm}}{\pgfqpoint{0.582cm}{1.496cm}}
\pgfpathcurveto{\pgfqpoint{0.557cm}{1.47cm}}{\pgfqpoint{0.542cm}{1.435cm}}{\pgfqpoint{0.542cm}{1.399cm}}
\pgfpathcurveto{\pgfqpoint{0.542cm}{1.363cm}}{\pgfqpoint{0.557cm}{1.328cm}}{\pgfqpoint{0.582cm}{1.302cm}}
\pgfpathcurveto{\pgfqpoint{0.608cm}{1.276cm}}{\pgfqpoint{0.643cm}{1.262cm}}{\pgfqpoint{0.679cm}{1.262cm}}
\pgfpathcurveto{\pgfqpoint{0.715cm}{1.262cm}}{\pgfqpoint{0.75cm}{1.276cm}}{\pgfqpoint{0.775cm}{1.302cm}}
\pgfpathcurveto{\pgfqpoint{0.801cm}{1.328cm}}{\pgfqpoint{0.815cm}{1.363cm}}{\pgfqpoint{0.815cm}{1.399cm}}
\pgfusepath{fill}
\pgfpathmoveto{\pgfqpoint{1.345cm}{1.371cm}}
\pgfpathcurveto{\pgfqpoint{1.345cm}{1.408cm}}{\pgfqpoint{1.331cm}{1.442cm}}{\pgfqpoint{1.305cm}{1.468cm}}
\pgfpathcurveto{\pgfqpoint{1.28cm}{1.494cm}}{\pgfqpoint{1.245cm}{1.508cm}}{\pgfqpoint{1.209cm}{1.508cm}}
\pgfpathcurveto{\pgfqpoint{1.172cm}{1.508cm}}{\pgfqpoint{1.138cm}{1.494cm}}{\pgfqpoint{1.112cm}{1.468cm}}
\pgfpathcurveto{\pgfqpoint{1.087cm}{1.442cm}}{\pgfqpoint{1.072cm}{1.408cm}}{\pgfqpoint{1.072cm}{1.371cm}}
\pgfpathcurveto{\pgfqpoint{1.072cm}{1.335cm}}{\pgfqpoint{1.087cm}{1.3cm}}{\pgfqpoint{1.112cm}{1.274cm}}
\pgfpathcurveto{\pgfqpoint{1.138cm}{1.249cm}}{\pgfqpoint{1.172cm}{1.234cm}}{\pgfqpoint{1.209cm}{1.234cm}}
\pgfpathcurveto{\pgfqpoint{1.245cm}{1.234cm}}{\pgfqpoint{1.28cm}{1.249cm}}{\pgfqpoint{1.305cm}{1.274cm}}
\pgfpathcurveto{\pgfqpoint{1.331cm}{1.3cm}}{\pgfqpoint{1.345cm}{1.335cm}}{\pgfqpoint{1.345cm}{1.371cm}}
\pgfusepath{fill}
\begin{pgfscope}
\pgfsetdash{}{0cm}
\pgfsetlinewidth{0.818mm}
\pgfsetroundcap
\pgfsetmiterlimit{4.0}
\pgfpathmoveto{\pgfqpoint{0.682cm}{0.671cm}}
\pgfpathlineto{\pgfqpoint{0.682cm}{0.042cm}}
\pgfusepath{stroke}
\end{pgfscope}
\end{pgfscope}
\end{pgfscope}
\end{pgfscope}
\end{tikzpicture}}}(\phi+\psi))}-\rmg{6X\prec(X^{\!\resizebox{0.6em}{!}{
\begin{tikzpicture}
\pgfpathmoveto{\pgfqpoint{0cm}{-0.035cm}}
\pgfpathlineto{\pgfqpoint{1.376cm}{-0.035cm}}
\pgfpathlineto{\pgfqpoint{1.376cm}{1.552cm}}
\pgfpathlineto{\pgfqpoint{0cm}{1.552cm}}
\pgfpathclose
\pgfusepath{clip}
\begin{pgfscope}
\begin{pgfscope}
\pgfpathmoveto{\pgfqpoint{0cm}{-0.035cm}}
\pgfpathlineto{\pgfqpoint{1.376cm}{-0.035cm}}
\pgfpathlineto{\pgfqpoint{1.376cm}{1.552cm}}
\pgfpathlineto{\pgfqpoint{0cm}{1.552cm}}
\pgfpathclose
\pgfusepath{clip}
\begin{pgfscope}
\begin{pgfscope}
\pgfsetdash{}{0cm}
\pgfsetlinewidth{0.818mm}
\pgfsetroundcap
\pgfsetroundjoin
\pgfsetmiterlimit{7.0}
\definecolor{eps2pgf_color}{gray}{0}\pgfsetstrokecolor{eps2pgf_color}\pgfsetfillcolor{eps2pgf_color}
\pgfpathmoveto{\pgfqpoint{0.117cm}{1.421cm}}
\pgfpathlineto{\pgfqpoint{0.682cm}{0.671cm}}
\pgfpathlineto{\pgfqpoint{1.246cm}{1.421cm}}
\pgfusepath{stroke}
\end{pgfscope}
\definecolor{eps2pgf_color}{gray}{0}\pgfsetstrokecolor{eps2pgf_color}\pgfsetfillcolor{eps2pgf_color}
\pgfpathmoveto{\pgfqpoint{0.273cm}{1.395cm}}
\pgfpathcurveto{\pgfqpoint{0.273cm}{1.432cm}}{\pgfqpoint{0.259cm}{1.467cm}}{\pgfqpoint{0.233cm}{1.492cm}}
\pgfpathcurveto{\pgfqpoint{0.207cm}{1.518cm}}{\pgfqpoint{0.173cm}{1.532cm}}{\pgfqpoint{0.137cm}{1.532cm}}
\pgfpathcurveto{\pgfqpoint{0.1cm}{1.532cm}}{\pgfqpoint{0.066cm}{1.518cm}}{\pgfqpoint{0.04cm}{1.492cm}}
\pgfpathcurveto{\pgfqpoint{0.014cm}{1.467cm}}{\pgfqpoint{0cm}{1.432cm}}{\pgfqpoint{0cm}{1.395cm}}
\pgfpathcurveto{\pgfqpoint{0cm}{1.359cm}}{\pgfqpoint{0.014cm}{1.324cm}}{\pgfqpoint{0.04cm}{1.299cm}}
\pgfpathcurveto{\pgfqpoint{0.066cm}{1.273cm}}{\pgfqpoint{0.1cm}{1.258cm}}{\pgfqpoint{0.137cm}{1.258cm}}
\pgfpathcurveto{\pgfqpoint{0.173cm}{1.258cm}}{\pgfqpoint{0.207cm}{1.273cm}}{\pgfqpoint{0.233cm}{1.299cm}}
\pgfpathcurveto{\pgfqpoint{0.259cm}{1.324cm}}{\pgfqpoint{0.273cm}{1.359cm}}{\pgfqpoint{0.273cm}{1.395cm}}
\pgfusepath{fill}
\begin{pgfscope}
\pgfsetdash{}{0cm}
\pgfsetlinewidth{0.818mm}
\pgfsetmiterlimit{7.0}
\pgfpathmoveto{\pgfqpoint{0.682cm}{0.671cm}}
\pgfpathlineto{\pgfqpoint{0.679cm}{1.418cm}}
\pgfusepath{stroke}
\end{pgfscope}
\pgfpathmoveto{\pgfqpoint{0.815cm}{1.399cm}}
\pgfpathcurveto{\pgfqpoint{0.815cm}{1.435cm}}{\pgfqpoint{0.801cm}{1.47cm}}{\pgfqpoint{0.775cm}{1.496cm}}
\pgfpathcurveto{\pgfqpoint{0.75cm}{1.521cm}}{\pgfqpoint{0.715cm}{1.536cm}}{\pgfqpoint{0.679cm}{1.536cm}}
\pgfpathcurveto{\pgfqpoint{0.643cm}{1.536cm}}{\pgfqpoint{0.608cm}{1.521cm}}{\pgfqpoint{0.582cm}{1.496cm}}
\pgfpathcurveto{\pgfqpoint{0.557cm}{1.47cm}}{\pgfqpoint{0.542cm}{1.435cm}}{\pgfqpoint{0.542cm}{1.399cm}}
\pgfpathcurveto{\pgfqpoint{0.542cm}{1.363cm}}{\pgfqpoint{0.557cm}{1.328cm}}{\pgfqpoint{0.582cm}{1.302cm}}
\pgfpathcurveto{\pgfqpoint{0.608cm}{1.276cm}}{\pgfqpoint{0.643cm}{1.262cm}}{\pgfqpoint{0.679cm}{1.262cm}}
\pgfpathcurveto{\pgfqpoint{0.715cm}{1.262cm}}{\pgfqpoint{0.75cm}{1.276cm}}{\pgfqpoint{0.775cm}{1.302cm}}
\pgfpathcurveto{\pgfqpoint{0.801cm}{1.328cm}}{\pgfqpoint{0.815cm}{1.363cm}}{\pgfqpoint{0.815cm}{1.399cm}}
\pgfusepath{fill}
\pgfpathmoveto{\pgfqpoint{1.345cm}{1.371cm}}
\pgfpathcurveto{\pgfqpoint{1.345cm}{1.408cm}}{\pgfqpoint{1.331cm}{1.442cm}}{\pgfqpoint{1.305cm}{1.468cm}}
\pgfpathcurveto{\pgfqpoint{1.28cm}{1.494cm}}{\pgfqpoint{1.245cm}{1.508cm}}{\pgfqpoint{1.209cm}{1.508cm}}
\pgfpathcurveto{\pgfqpoint{1.172cm}{1.508cm}}{\pgfqpoint{1.138cm}{1.494cm}}{\pgfqpoint{1.112cm}{1.468cm}}
\pgfpathcurveto{\pgfqpoint{1.087cm}{1.442cm}}{\pgfqpoint{1.072cm}{1.408cm}}{\pgfqpoint{1.072cm}{1.371cm}}
\pgfpathcurveto{\pgfqpoint{1.072cm}{1.335cm}}{\pgfqpoint{1.087cm}{1.3cm}}{\pgfqpoint{1.112cm}{1.274cm}}
\pgfpathcurveto{\pgfqpoint{1.138cm}{1.249cm}}{\pgfqpoint{1.172cm}{1.234cm}}{\pgfqpoint{1.209cm}{1.234cm}}
\pgfpathcurveto{\pgfqpoint{1.245cm}{1.234cm}}{\pgfqpoint{1.28cm}{1.249cm}}{\pgfqpoint{1.305cm}{1.274cm}}
\pgfpathcurveto{\pgfqpoint{1.331cm}{1.3cm}}{\pgfqpoint{1.345cm}{1.335cm}}{\pgfqpoint{1.345cm}{1.371cm}}
\pgfusepath{fill}
\begin{pgfscope}
\pgfsetdash{}{0cm}
\pgfsetlinewidth{0.818mm}
\pgfsetroundcap
\pgfsetmiterlimit{4.0}
\pgfpathmoveto{\pgfqpoint{0.682cm}{0.671cm}}
\pgfpathlineto{\pgfqpoint{0.682cm}{0.042cm}}
\pgfusepath{stroke}
\end{pgfscope}
\end{pgfscope}
\end{pgfscope}
\end{pgfscope}
\end{tikzpicture}}}(\phi+\psi))}-\rmb{6X\circ(X^{\!\resizebox{0.6em}{!}{
\begin{tikzpicture}
\pgfpathmoveto{\pgfqpoint{0cm}{-0.035cm}}
\pgfpathlineto{\pgfqpoint{1.376cm}{-0.035cm}}
\pgfpathlineto{\pgfqpoint{1.376cm}{1.552cm}}
\pgfpathlineto{\pgfqpoint{0cm}{1.552cm}}
\pgfpathclose
\pgfusepath{clip}
\begin{pgfscope}
\begin{pgfscope}
\pgfpathmoveto{\pgfqpoint{0cm}{-0.035cm}}
\pgfpathlineto{\pgfqpoint{1.376cm}{-0.035cm}}
\pgfpathlineto{\pgfqpoint{1.376cm}{1.552cm}}
\pgfpathlineto{\pgfqpoint{0cm}{1.552cm}}
\pgfpathclose
\pgfusepath{clip}
\begin{pgfscope}
\begin{pgfscope}
\pgfsetdash{}{0cm}
\pgfsetlinewidth{0.818mm}
\pgfsetroundcap
\pgfsetroundjoin
\pgfsetmiterlimit{7.0}
\definecolor{eps2pgf_color}{gray}{0}\pgfsetstrokecolor{eps2pgf_color}\pgfsetfillcolor{eps2pgf_color}
\pgfpathmoveto{\pgfqpoint{0.117cm}{1.421cm}}
\pgfpathlineto{\pgfqpoint{0.682cm}{0.671cm}}
\pgfpathlineto{\pgfqpoint{1.246cm}{1.421cm}}
\pgfusepath{stroke}
\end{pgfscope}
\definecolor{eps2pgf_color}{gray}{0}\pgfsetstrokecolor{eps2pgf_color}\pgfsetfillcolor{eps2pgf_color}
\pgfpathmoveto{\pgfqpoint{0.273cm}{1.395cm}}
\pgfpathcurveto{\pgfqpoint{0.273cm}{1.432cm}}{\pgfqpoint{0.259cm}{1.467cm}}{\pgfqpoint{0.233cm}{1.492cm}}
\pgfpathcurveto{\pgfqpoint{0.207cm}{1.518cm}}{\pgfqpoint{0.173cm}{1.532cm}}{\pgfqpoint{0.137cm}{1.532cm}}
\pgfpathcurveto{\pgfqpoint{0.1cm}{1.532cm}}{\pgfqpoint{0.066cm}{1.518cm}}{\pgfqpoint{0.04cm}{1.492cm}}
\pgfpathcurveto{\pgfqpoint{0.014cm}{1.467cm}}{\pgfqpoint{0cm}{1.432cm}}{\pgfqpoint{0cm}{1.395cm}}
\pgfpathcurveto{\pgfqpoint{0cm}{1.359cm}}{\pgfqpoint{0.014cm}{1.324cm}}{\pgfqpoint{0.04cm}{1.299cm}}
\pgfpathcurveto{\pgfqpoint{0.066cm}{1.273cm}}{\pgfqpoint{0.1cm}{1.258cm}}{\pgfqpoint{0.137cm}{1.258cm}}
\pgfpathcurveto{\pgfqpoint{0.173cm}{1.258cm}}{\pgfqpoint{0.207cm}{1.273cm}}{\pgfqpoint{0.233cm}{1.299cm}}
\pgfpathcurveto{\pgfqpoint{0.259cm}{1.324cm}}{\pgfqpoint{0.273cm}{1.359cm}}{\pgfqpoint{0.273cm}{1.395cm}}
\pgfusepath{fill}
\begin{pgfscope}
\pgfsetdash{}{0cm}
\pgfsetlinewidth{0.818mm}
\pgfsetmiterlimit{7.0}
\pgfpathmoveto{\pgfqpoint{0.682cm}{0.671cm}}
\pgfpathlineto{\pgfqpoint{0.679cm}{1.418cm}}
\pgfusepath{stroke}
\end{pgfscope}
\pgfpathmoveto{\pgfqpoint{0.815cm}{1.399cm}}
\pgfpathcurveto{\pgfqpoint{0.815cm}{1.435cm}}{\pgfqpoint{0.801cm}{1.47cm}}{\pgfqpoint{0.775cm}{1.496cm}}
\pgfpathcurveto{\pgfqpoint{0.75cm}{1.521cm}}{\pgfqpoint{0.715cm}{1.536cm}}{\pgfqpoint{0.679cm}{1.536cm}}
\pgfpathcurveto{\pgfqpoint{0.643cm}{1.536cm}}{\pgfqpoint{0.608cm}{1.521cm}}{\pgfqpoint{0.582cm}{1.496cm}}
\pgfpathcurveto{\pgfqpoint{0.557cm}{1.47cm}}{\pgfqpoint{0.542cm}{1.435cm}}{\pgfqpoint{0.542cm}{1.399cm}}
\pgfpathcurveto{\pgfqpoint{0.542cm}{1.363cm}}{\pgfqpoint{0.557cm}{1.328cm}}{\pgfqpoint{0.582cm}{1.302cm}}
\pgfpathcurveto{\pgfqpoint{0.608cm}{1.276cm}}{\pgfqpoint{0.643cm}{1.262cm}}{\pgfqpoint{0.679cm}{1.262cm}}
\pgfpathcurveto{\pgfqpoint{0.715cm}{1.262cm}}{\pgfqpoint{0.75cm}{1.276cm}}{\pgfqpoint{0.775cm}{1.302cm}}
\pgfpathcurveto{\pgfqpoint{0.801cm}{1.328cm}}{\pgfqpoint{0.815cm}{1.363cm}}{\pgfqpoint{0.815cm}{1.399cm}}
\pgfusepath{fill}
\pgfpathmoveto{\pgfqpoint{1.345cm}{1.371cm}}
\pgfpathcurveto{\pgfqpoint{1.345cm}{1.408cm}}{\pgfqpoint{1.331cm}{1.442cm}}{\pgfqpoint{1.305cm}{1.468cm}}
\pgfpathcurveto{\pgfqpoint{1.28cm}{1.494cm}}{\pgfqpoint{1.245cm}{1.508cm}}{\pgfqpoint{1.209cm}{1.508cm}}
\pgfpathcurveto{\pgfqpoint{1.172cm}{1.508cm}}{\pgfqpoint{1.138cm}{1.494cm}}{\pgfqpoint{1.112cm}{1.468cm}}
\pgfpathcurveto{\pgfqpoint{1.087cm}{1.442cm}}{\pgfqpoint{1.072cm}{1.408cm}}{\pgfqpoint{1.072cm}{1.371cm}}
\pgfpathcurveto{\pgfqpoint{1.072cm}{1.335cm}}{\pgfqpoint{1.087cm}{1.3cm}}{\pgfqpoint{1.112cm}{1.274cm}}
\pgfpathcurveto{\pgfqpoint{1.138cm}{1.249cm}}{\pgfqpoint{1.172cm}{1.234cm}}{\pgfqpoint{1.209cm}{1.234cm}}
\pgfpathcurveto{\pgfqpoint{1.245cm}{1.234cm}}{\pgfqpoint{1.28cm}{1.249cm}}{\pgfqpoint{1.305cm}{1.274cm}}
\pgfpathcurveto{\pgfqpoint{1.331cm}{1.3cm}}{\pgfqpoint{1.345cm}{1.335cm}}{\pgfqpoint{1.345cm}{1.371cm}}
\pgfusepath{fill}
\begin{pgfscope}
\pgfsetdash{}{0cm}
\pgfsetlinewidth{0.818mm}
\pgfsetroundcap
\pgfsetmiterlimit{4.0}
\pgfpathmoveto{\pgfqpoint{0.682cm}{0.671cm}}
\pgfpathlineto{\pgfqpoint{0.682cm}{0.042cm}}
\pgfusepath{stroke}
\end{pgfscope}
\end{pgfscope}
\end{pgfscope}
\end{pgfscope}
\end{tikzpicture}}}\preccurlyeq(\phi+\psi))}\\
&\quad-\rmg{6(\phi+\psi)X^{\!\resizebox{!}{.8em}{
\begin{tikzpicture}
\pgfpathmoveto{\pgfqpoint{0cm}{-0.035cm}}
\pgfpathlineto{\pgfqpoint{1.976cm}{-0.035cm}}
\pgfpathlineto{\pgfqpoint{1.976cm}{1.94cm}}
\pgfpathlineto{\pgfqpoint{0cm}{1.94cm}}
\pgfpathclose
\pgfusepath{clip}
\begin{pgfscope}
\begin{pgfscope}
\pgfpathmoveto{\pgfqpoint{0cm}{-0.035cm}}
\pgfpathlineto{\pgfqpoint{1.976cm}{-0.035cm}}
\pgfpathlineto{\pgfqpoint{1.976cm}{1.94cm}}
\pgfpathlineto{\pgfqpoint{0cm}{1.94cm}}
\pgfpathclose
\pgfusepath{clip}
\begin{pgfscope}
\begin{pgfscope}
\pgfsetdash{}{0cm}
\pgfsetlinewidth{0.818mm}
\pgfsetroundcap
\pgfsetroundjoin
\pgfsetmiterlimit{7.0}
\definecolor{eps2pgf_color}{gray}{0}\pgfsetstrokecolor{eps2pgf_color}\pgfsetfillcolor{eps2pgf_color}
\pgfpathmoveto{\pgfqpoint{0.117cm}{1.815cm}}
\pgfpathlineto{\pgfqpoint{0.682cm}{1.065cm}}
\pgfpathlineto{\pgfqpoint{1.246cm}{1.815cm}}
\pgfusepath{stroke}
\end{pgfscope}
\definecolor{eps2pgf_color}{gray}{0}\pgfsetstrokecolor{eps2pgf_color}\pgfsetfillcolor{eps2pgf_color}
\pgfpathmoveto{\pgfqpoint{0.273cm}{1.789cm}}
\pgfpathcurveto{\pgfqpoint{0.273cm}{1.825cm}}{\pgfqpoint{0.259cm}{1.86cm}}{\pgfqpoint{0.233cm}{1.886cm}}
\pgfpathcurveto{\pgfqpoint{0.207cm}{1.912cm}}{\pgfqpoint{0.173cm}{1.926cm}}{\pgfqpoint{0.137cm}{1.926cm}}
\pgfpathcurveto{\pgfqpoint{0.1cm}{1.926cm}}{\pgfqpoint{0.066cm}{1.912cm}}{\pgfqpoint{0.04cm}{1.886cm}}
\pgfpathcurveto{\pgfqpoint{0.014cm}{1.86cm}}{\pgfqpoint{0cm}{1.825cm}}{\pgfqpoint{0cm}{1.789cm}}
\pgfpathcurveto{\pgfqpoint{0cm}{1.753cm}}{\pgfqpoint{0.014cm}{1.718cm}}{\pgfqpoint{0.04cm}{1.692cm}}
\pgfpathcurveto{\pgfqpoint{0.066cm}{1.667cm}}{\pgfqpoint{0.1cm}{1.652cm}}{\pgfqpoint{0.137cm}{1.652cm}}
\pgfpathcurveto{\pgfqpoint{0.173cm}{1.652cm}}{\pgfqpoint{0.207cm}{1.667cm}}{\pgfqpoint{0.233cm}{1.692cm}}
\pgfpathcurveto{\pgfqpoint{0.259cm}{1.718cm}}{\pgfqpoint{0.273cm}{1.753cm}}{\pgfqpoint{0.273cm}{1.789cm}}
\pgfusepath{fill}
\begin{pgfscope}
\pgfsetdash{}{0cm}
\pgfsetlinewidth{0.818mm}
\pgfsetmiterlimit{7.0}
\pgfpathmoveto{\pgfqpoint{0.682cm}{1.065cm}}
\pgfpathlineto{\pgfqpoint{0.679cm}{1.812cm}}
\pgfusepath{stroke}
\end{pgfscope}
\pgfpathmoveto{\pgfqpoint{0.815cm}{1.793cm}}
\pgfpathcurveto{\pgfqpoint{0.815cm}{1.829cm}}{\pgfqpoint{0.801cm}{1.864cm}}{\pgfqpoint{0.775cm}{1.89cm}}
\pgfpathcurveto{\pgfqpoint{0.75cm}{1.915cm}}{\pgfqpoint{0.715cm}{1.93cm}}{\pgfqpoint{0.679cm}{1.93cm}}
\pgfpathcurveto{\pgfqpoint{0.643cm}{1.93cm}}{\pgfqpoint{0.608cm}{1.915cm}}{\pgfqpoint{0.582cm}{1.89cm}}
\pgfpathcurveto{\pgfqpoint{0.557cm}{1.864cm}}{\pgfqpoint{0.542cm}{1.829cm}}{\pgfqpoint{0.542cm}{1.793cm}}
\pgfpathcurveto{\pgfqpoint{0.542cm}{1.756cm}}{\pgfqpoint{0.557cm}{1.722cm}}{\pgfqpoint{0.582cm}{1.696cm}}
\pgfpathcurveto{\pgfqpoint{0.608cm}{1.67cm}}{\pgfqpoint{0.643cm}{1.656cm}}{\pgfqpoint{0.679cm}{1.656cm}}
\pgfpathcurveto{\pgfqpoint{0.715cm}{1.656cm}}{\pgfqpoint{0.75cm}{1.67cm}}{\pgfqpoint{0.775cm}{1.696cm}}
\pgfpathcurveto{\pgfqpoint{0.801cm}{1.722cm}}{\pgfqpoint{0.815cm}{1.756cm}}{\pgfqpoint{0.815cm}{1.793cm}}
\pgfusepath{fill}
\pgfpathmoveto{\pgfqpoint{1.345cm}{1.765cm}}
\pgfpathcurveto{\pgfqpoint{1.345cm}{1.801cm}}{\pgfqpoint{1.331cm}{1.836cm}}{\pgfqpoint{1.305cm}{1.862cm}}
\pgfpathcurveto{\pgfqpoint{1.28cm}{1.887cm}}{\pgfqpoint{1.245cm}{1.902cm}}{\pgfqpoint{1.209cm}{1.902cm}}
\pgfpathcurveto{\pgfqpoint{1.172cm}{1.902cm}}{\pgfqpoint{1.138cm}{1.887cm}}{\pgfqpoint{1.112cm}{1.862cm}}
\pgfpathcurveto{\pgfqpoint{1.087cm}{1.836cm}}{\pgfqpoint{1.072cm}{1.801cm}}{\pgfqpoint{1.072cm}{1.765cm}}
\pgfpathcurveto{\pgfqpoint{1.072cm}{1.728cm}}{\pgfqpoint{1.087cm}{1.694cm}}{\pgfqpoint{1.112cm}{1.668cm}}
\pgfpathcurveto{\pgfqpoint{1.138cm}{1.642cm}}{\pgfqpoint{1.172cm}{1.628cm}}{\pgfqpoint{1.209cm}{1.628cm}}
\pgfpathcurveto{\pgfqpoint{1.245cm}{1.628cm}}{\pgfqpoint{1.28cm}{1.642cm}}{\pgfqpoint{1.305cm}{1.668cm}}
\pgfpathcurveto{\pgfqpoint{1.331cm}{1.694cm}}{\pgfqpoint{1.345cm}{1.728cm}}{\pgfqpoint{1.345cm}{1.765cm}}
\pgfusepath{fill}
\begin{pgfscope}
\pgfsetdash{}{0cm}
\pgfsetlinewidth{0.818mm}
\pgfsetroundcap
\pgfsetroundjoin
\pgfsetmiterlimit{7.0}
\pgfpathmoveto{\pgfqpoint{0.682cm}{1.065cm}}
\pgfpathlineto{\pgfqpoint{1.246cm}{0.315cm}}
\pgfpathlineto{\pgfqpoint{1.811cm}{1.065cm}}
\pgfusepath{stroke}
\end{pgfscope}
\pgfpathmoveto{\pgfqpoint{1.948cm}{1.065cm}}
\pgfpathcurveto{\pgfqpoint{1.948cm}{1.101cm}}{\pgfqpoint{1.933cm}{1.136cm}}{\pgfqpoint{1.907cm}{1.162cm}}
\pgfpathcurveto{\pgfqpoint{1.882cm}{1.187cm}}{\pgfqpoint{1.847cm}{1.202cm}}{\pgfqpoint{1.811cm}{1.202cm}}
\pgfpathcurveto{\pgfqpoint{1.775cm}{1.202cm}}{\pgfqpoint{1.74cm}{1.187cm}}{\pgfqpoint{1.714cm}{1.162cm}}
\pgfpathcurveto{\pgfqpoint{1.689cm}{1.136cm}}{\pgfqpoint{1.674cm}{1.101cm}}{\pgfqpoint{1.674cm}{1.065cm}}
\pgfpathcurveto{\pgfqpoint{1.674cm}{1.029cm}}{\pgfqpoint{1.689cm}{0.994cm}}{\pgfqpoint{1.714cm}{0.968cm}}
\pgfpathcurveto{\pgfqpoint{1.74cm}{0.942cm}}{\pgfqpoint{1.775cm}{0.928cm}}{\pgfqpoint{1.811cm}{0.928cm}}
\pgfpathcurveto{\pgfqpoint{1.847cm}{0.928cm}}{\pgfqpoint{1.882cm}{0.942cm}}{\pgfqpoint{1.907cm}{0.968cm}}
\pgfpathcurveto{\pgfqpoint{1.933cm}{0.994cm}}{\pgfqpoint{1.948cm}{1.029cm}}{\pgfqpoint{1.948cm}{1.065cm}}
\pgfusepath{fill}
\begin{pgfscope}
\pgfsetdash{}{0cm}
\pgfsetlinewidth{0.818mm}
\pgfsetmiterlimit{4.0}
\pgfpathmoveto{\pgfqpoint{1.383cm}{0.178cm}}
\pgfpathcurveto{\pgfqpoint{1.383cm}{0.214cm}}{\pgfqpoint{1.369cm}{0.249cm}}{\pgfqpoint{1.343cm}{0.275cm}}
\pgfpathcurveto{\pgfqpoint{1.317cm}{0.3cm}}{\pgfqpoint{1.283cm}{0.315cm}}{\pgfqpoint{1.246cm}{0.315cm}}
\pgfpathcurveto{\pgfqpoint{1.21cm}{0.315cm}}{\pgfqpoint{1.175cm}{0.3cm}}{\pgfqpoint{1.15cm}{0.275cm}}
\pgfpathcurveto{\pgfqpoint{1.124cm}{0.249cm}}{\pgfqpoint{1.11cm}{0.214cm}}{\pgfqpoint{1.11cm}{0.178cm}}
\pgfpathcurveto{\pgfqpoint{1.11cm}{0.141cm}}{\pgfqpoint{1.124cm}{0.107cm}}{\pgfqpoint{1.15cm}{0.081cm}}
\pgfpathcurveto{\pgfqpoint{1.175cm}{0.055cm}}{\pgfqpoint{1.21cm}{0.041cm}}{\pgfqpoint{1.246cm}{0.041cm}}
\pgfpathcurveto{\pgfqpoint{1.283cm}{0.041cm}}{\pgfqpoint{1.317cm}{0.055cm}}{\pgfqpoint{1.343cm}{0.081cm}}
\pgfpathcurveto{\pgfqpoint{1.369cm}{0.107cm}}{\pgfqpoint{1.383cm}{0.141cm}}{\pgfqpoint{1.383cm}{0.178cm}}
\pgfusepath{stroke}
\end{pgfscope}
\end{pgfscope}
\end{pgfscope}
\end{pgfscope}
\end{tikzpicture}}}}-\rmb{6\mathrm{com}(\phi+\psi,X^{\!\resizebox{0.6em}{!}{
\begin{tikzpicture}
\pgfpathmoveto{\pgfqpoint{0cm}{-0.035cm}}
\pgfpathlineto{\pgfqpoint{1.376cm}{-0.035cm}}
\pgfpathlineto{\pgfqpoint{1.376cm}{1.552cm}}
\pgfpathlineto{\pgfqpoint{0cm}{1.552cm}}
\pgfpathclose
\pgfusepath{clip}
\begin{pgfscope}
\begin{pgfscope}
\pgfpathmoveto{\pgfqpoint{0cm}{-0.035cm}}
\pgfpathlineto{\pgfqpoint{1.376cm}{-0.035cm}}
\pgfpathlineto{\pgfqpoint{1.376cm}{1.552cm}}
\pgfpathlineto{\pgfqpoint{0cm}{1.552cm}}
\pgfpathclose
\pgfusepath{clip}
\begin{pgfscope}
\begin{pgfscope}
\pgfsetdash{}{0cm}
\pgfsetlinewidth{0.818mm}
\pgfsetroundcap
\pgfsetroundjoin
\pgfsetmiterlimit{7.0}
\definecolor{eps2pgf_color}{gray}{0}\pgfsetstrokecolor{eps2pgf_color}\pgfsetfillcolor{eps2pgf_color}
\pgfpathmoveto{\pgfqpoint{0.117cm}{1.421cm}}
\pgfpathlineto{\pgfqpoint{0.682cm}{0.671cm}}
\pgfpathlineto{\pgfqpoint{1.246cm}{1.421cm}}
\pgfusepath{stroke}
\end{pgfscope}
\definecolor{eps2pgf_color}{gray}{0}\pgfsetstrokecolor{eps2pgf_color}\pgfsetfillcolor{eps2pgf_color}
\pgfpathmoveto{\pgfqpoint{0.273cm}{1.395cm}}
\pgfpathcurveto{\pgfqpoint{0.273cm}{1.432cm}}{\pgfqpoint{0.259cm}{1.467cm}}{\pgfqpoint{0.233cm}{1.492cm}}
\pgfpathcurveto{\pgfqpoint{0.207cm}{1.518cm}}{\pgfqpoint{0.173cm}{1.532cm}}{\pgfqpoint{0.137cm}{1.532cm}}
\pgfpathcurveto{\pgfqpoint{0.1cm}{1.532cm}}{\pgfqpoint{0.066cm}{1.518cm}}{\pgfqpoint{0.04cm}{1.492cm}}
\pgfpathcurveto{\pgfqpoint{0.014cm}{1.467cm}}{\pgfqpoint{0cm}{1.432cm}}{\pgfqpoint{0cm}{1.395cm}}
\pgfpathcurveto{\pgfqpoint{0cm}{1.359cm}}{\pgfqpoint{0.014cm}{1.324cm}}{\pgfqpoint{0.04cm}{1.299cm}}
\pgfpathcurveto{\pgfqpoint{0.066cm}{1.273cm}}{\pgfqpoint{0.1cm}{1.258cm}}{\pgfqpoint{0.137cm}{1.258cm}}
\pgfpathcurveto{\pgfqpoint{0.173cm}{1.258cm}}{\pgfqpoint{0.207cm}{1.273cm}}{\pgfqpoint{0.233cm}{1.299cm}}
\pgfpathcurveto{\pgfqpoint{0.259cm}{1.324cm}}{\pgfqpoint{0.273cm}{1.359cm}}{\pgfqpoint{0.273cm}{1.395cm}}
\pgfusepath{fill}
\begin{pgfscope}
\pgfsetdash{}{0cm}
\pgfsetlinewidth{0.818mm}
\pgfsetmiterlimit{7.0}
\pgfpathmoveto{\pgfqpoint{0.682cm}{0.671cm}}
\pgfpathlineto{\pgfqpoint{0.679cm}{1.418cm}}
\pgfusepath{stroke}
\end{pgfscope}
\pgfpathmoveto{\pgfqpoint{0.815cm}{1.399cm}}
\pgfpathcurveto{\pgfqpoint{0.815cm}{1.435cm}}{\pgfqpoint{0.801cm}{1.47cm}}{\pgfqpoint{0.775cm}{1.496cm}}
\pgfpathcurveto{\pgfqpoint{0.75cm}{1.521cm}}{\pgfqpoint{0.715cm}{1.536cm}}{\pgfqpoint{0.679cm}{1.536cm}}
\pgfpathcurveto{\pgfqpoint{0.643cm}{1.536cm}}{\pgfqpoint{0.608cm}{1.521cm}}{\pgfqpoint{0.582cm}{1.496cm}}
\pgfpathcurveto{\pgfqpoint{0.557cm}{1.47cm}}{\pgfqpoint{0.542cm}{1.435cm}}{\pgfqpoint{0.542cm}{1.399cm}}
\pgfpathcurveto{\pgfqpoint{0.542cm}{1.363cm}}{\pgfqpoint{0.557cm}{1.328cm}}{\pgfqpoint{0.582cm}{1.302cm}}
\pgfpathcurveto{\pgfqpoint{0.608cm}{1.276cm}}{\pgfqpoint{0.643cm}{1.262cm}}{\pgfqpoint{0.679cm}{1.262cm}}
\pgfpathcurveto{\pgfqpoint{0.715cm}{1.262cm}}{\pgfqpoint{0.75cm}{1.276cm}}{\pgfqpoint{0.775cm}{1.302cm}}
\pgfpathcurveto{\pgfqpoint{0.801cm}{1.328cm}}{\pgfqpoint{0.815cm}{1.363cm}}{\pgfqpoint{0.815cm}{1.399cm}}
\pgfusepath{fill}
\pgfpathmoveto{\pgfqpoint{1.345cm}{1.371cm}}
\pgfpathcurveto{\pgfqpoint{1.345cm}{1.408cm}}{\pgfqpoint{1.331cm}{1.442cm}}{\pgfqpoint{1.305cm}{1.468cm}}
\pgfpathcurveto{\pgfqpoint{1.28cm}{1.494cm}}{\pgfqpoint{1.245cm}{1.508cm}}{\pgfqpoint{1.209cm}{1.508cm}}
\pgfpathcurveto{\pgfqpoint{1.172cm}{1.508cm}}{\pgfqpoint{1.138cm}{1.494cm}}{\pgfqpoint{1.112cm}{1.468cm}}
\pgfpathcurveto{\pgfqpoint{1.087cm}{1.442cm}}{\pgfqpoint{1.072cm}{1.408cm}}{\pgfqpoint{1.072cm}{1.371cm}}
\pgfpathcurveto{\pgfqpoint{1.072cm}{1.335cm}}{\pgfqpoint{1.087cm}{1.3cm}}{\pgfqpoint{1.112cm}{1.274cm}}
\pgfpathcurveto{\pgfqpoint{1.138cm}{1.249cm}}{\pgfqpoint{1.172cm}{1.234cm}}{\pgfqpoint{1.209cm}{1.234cm}}
\pgfpathcurveto{\pgfqpoint{1.245cm}{1.234cm}}{\pgfqpoint{1.28cm}{1.249cm}}{\pgfqpoint{1.305cm}{1.274cm}}
\pgfpathcurveto{\pgfqpoint{1.331cm}{1.3cm}}{\pgfqpoint{1.345cm}{1.335cm}}{\pgfqpoint{1.345cm}{1.371cm}}
\pgfusepath{fill}
\begin{pgfscope}
\pgfsetdash{}{0cm}
\pgfsetlinewidth{0.818mm}
\pgfsetroundcap
\pgfsetmiterlimit{4.0}
\pgfpathmoveto{\pgfqpoint{0.682cm}{0.671cm}}
\pgfpathlineto{\pgfqpoint{0.682cm}{0.042cm}}
\pgfusepath{stroke}
\end{pgfscope}
\end{pgfscope}
\end{pgfscope}
\end{pgfscope}
\end{tikzpicture}}},X)}+\rmg{3X(\phi+\psi)^2}.
\end{align*}
For the remaining term in \eqref{eq:rhs45a} we write
$$
(-X^{\!\resizebox{0.6em}{!}{
\begin{tikzpicture}
\pgfpathmoveto{\pgfqpoint{0cm}{-0.035cm}}
\pgfpathlineto{\pgfqpoint{1.376cm}{-0.035cm}}
\pgfpathlineto{\pgfqpoint{1.376cm}{1.552cm}}
\pgfpathlineto{\pgfqpoint{0cm}{1.552cm}}
\pgfpathclose
\pgfusepath{clip}
\begin{pgfscope}
\begin{pgfscope}
\pgfpathmoveto{\pgfqpoint{0cm}{-0.035cm}}
\pgfpathlineto{\pgfqpoint{1.376cm}{-0.035cm}}
\pgfpathlineto{\pgfqpoint{1.376cm}{1.552cm}}
\pgfpathlineto{\pgfqpoint{0cm}{1.552cm}}
\pgfpathclose
\pgfusepath{clip}
\begin{pgfscope}
\begin{pgfscope}
\pgfsetdash{}{0cm}
\pgfsetlinewidth{0.818mm}
\pgfsetroundcap
\pgfsetroundjoin
\pgfsetmiterlimit{7.0}
\definecolor{eps2pgf_color}{gray}{0}\pgfsetstrokecolor{eps2pgf_color}\pgfsetfillcolor{eps2pgf_color}
\pgfpathmoveto{\pgfqpoint{0.117cm}{1.421cm}}
\pgfpathlineto{\pgfqpoint{0.682cm}{0.671cm}}
\pgfpathlineto{\pgfqpoint{1.246cm}{1.421cm}}
\pgfusepath{stroke}
\end{pgfscope}
\definecolor{eps2pgf_color}{gray}{0}\pgfsetstrokecolor{eps2pgf_color}\pgfsetfillcolor{eps2pgf_color}
\pgfpathmoveto{\pgfqpoint{0.273cm}{1.395cm}}
\pgfpathcurveto{\pgfqpoint{0.273cm}{1.432cm}}{\pgfqpoint{0.259cm}{1.467cm}}{\pgfqpoint{0.233cm}{1.492cm}}
\pgfpathcurveto{\pgfqpoint{0.207cm}{1.518cm}}{\pgfqpoint{0.173cm}{1.532cm}}{\pgfqpoint{0.137cm}{1.532cm}}
\pgfpathcurveto{\pgfqpoint{0.1cm}{1.532cm}}{\pgfqpoint{0.066cm}{1.518cm}}{\pgfqpoint{0.04cm}{1.492cm}}
\pgfpathcurveto{\pgfqpoint{0.014cm}{1.467cm}}{\pgfqpoint{0cm}{1.432cm}}{\pgfqpoint{0cm}{1.395cm}}
\pgfpathcurveto{\pgfqpoint{0cm}{1.359cm}}{\pgfqpoint{0.014cm}{1.324cm}}{\pgfqpoint{0.04cm}{1.299cm}}
\pgfpathcurveto{\pgfqpoint{0.066cm}{1.273cm}}{\pgfqpoint{0.1cm}{1.258cm}}{\pgfqpoint{0.137cm}{1.258cm}}
\pgfpathcurveto{\pgfqpoint{0.173cm}{1.258cm}}{\pgfqpoint{0.207cm}{1.273cm}}{\pgfqpoint{0.233cm}{1.299cm}}
\pgfpathcurveto{\pgfqpoint{0.259cm}{1.324cm}}{\pgfqpoint{0.273cm}{1.359cm}}{\pgfqpoint{0.273cm}{1.395cm}}
\pgfusepath{fill}
\begin{pgfscope}
\pgfsetdash{}{0cm}
\pgfsetlinewidth{0.818mm}
\pgfsetmiterlimit{7.0}
\pgfpathmoveto{\pgfqpoint{0.682cm}{0.671cm}}
\pgfpathlineto{\pgfqpoint{0.679cm}{1.418cm}}
\pgfusepath{stroke}
\end{pgfscope}
\pgfpathmoveto{\pgfqpoint{0.815cm}{1.399cm}}
\pgfpathcurveto{\pgfqpoint{0.815cm}{1.435cm}}{\pgfqpoint{0.801cm}{1.47cm}}{\pgfqpoint{0.775cm}{1.496cm}}
\pgfpathcurveto{\pgfqpoint{0.75cm}{1.521cm}}{\pgfqpoint{0.715cm}{1.536cm}}{\pgfqpoint{0.679cm}{1.536cm}}
\pgfpathcurveto{\pgfqpoint{0.643cm}{1.536cm}}{\pgfqpoint{0.608cm}{1.521cm}}{\pgfqpoint{0.582cm}{1.496cm}}
\pgfpathcurveto{\pgfqpoint{0.557cm}{1.47cm}}{\pgfqpoint{0.542cm}{1.435cm}}{\pgfqpoint{0.542cm}{1.399cm}}
\pgfpathcurveto{\pgfqpoint{0.542cm}{1.363cm}}{\pgfqpoint{0.557cm}{1.328cm}}{\pgfqpoint{0.582cm}{1.302cm}}
\pgfpathcurveto{\pgfqpoint{0.608cm}{1.276cm}}{\pgfqpoint{0.643cm}{1.262cm}}{\pgfqpoint{0.679cm}{1.262cm}}
\pgfpathcurveto{\pgfqpoint{0.715cm}{1.262cm}}{\pgfqpoint{0.75cm}{1.276cm}}{\pgfqpoint{0.775cm}{1.302cm}}
\pgfpathcurveto{\pgfqpoint{0.801cm}{1.328cm}}{\pgfqpoint{0.815cm}{1.363cm}}{\pgfqpoint{0.815cm}{1.399cm}}
\pgfusepath{fill}
\pgfpathmoveto{\pgfqpoint{1.345cm}{1.371cm}}
\pgfpathcurveto{\pgfqpoint{1.345cm}{1.408cm}}{\pgfqpoint{1.331cm}{1.442cm}}{\pgfqpoint{1.305cm}{1.468cm}}
\pgfpathcurveto{\pgfqpoint{1.28cm}{1.494cm}}{\pgfqpoint{1.245cm}{1.508cm}}{\pgfqpoint{1.209cm}{1.508cm}}
\pgfpathcurveto{\pgfqpoint{1.172cm}{1.508cm}}{\pgfqpoint{1.138cm}{1.494cm}}{\pgfqpoint{1.112cm}{1.468cm}}
\pgfpathcurveto{\pgfqpoint{1.087cm}{1.442cm}}{\pgfqpoint{1.072cm}{1.408cm}}{\pgfqpoint{1.072cm}{1.371cm}}
\pgfpathcurveto{\pgfqpoint{1.072cm}{1.335cm}}{\pgfqpoint{1.087cm}{1.3cm}}{\pgfqpoint{1.112cm}{1.274cm}}
\pgfpathcurveto{\pgfqpoint{1.138cm}{1.249cm}}{\pgfqpoint{1.172cm}{1.234cm}}{\pgfqpoint{1.209cm}{1.234cm}}
\pgfpathcurveto{\pgfqpoint{1.245cm}{1.234cm}}{\pgfqpoint{1.28cm}{1.249cm}}{\pgfqpoint{1.305cm}{1.274cm}}
\pgfpathcurveto{\pgfqpoint{1.331cm}{1.3cm}}{\pgfqpoint{1.345cm}{1.335cm}}{\pgfqpoint{1.345cm}{1.371cm}}
\pgfusepath{fill}
\begin{pgfscope}
\pgfsetdash{}{0cm}
\pgfsetlinewidth{0.818mm}
\pgfsetroundcap
\pgfsetmiterlimit{4.0}
\pgfpathmoveto{\pgfqpoint{0.682cm}{0.671cm}}
\pgfpathlineto{\pgfqpoint{0.682cm}{0.042cm}}
\pgfusepath{stroke}
\end{pgfscope}
\end{pgfscope}
\end{pgfscope}
\end{pgfscope}
\end{tikzpicture}}}+\phi+\psi)^3=\rmb{(-X^{\!\resizebox{0.6em}{!}{
\begin{tikzpicture}
\pgfpathmoveto{\pgfqpoint{0cm}{-0.035cm}}
\pgfpathlineto{\pgfqpoint{1.376cm}{-0.035cm}}
\pgfpathlineto{\pgfqpoint{1.376cm}{1.552cm}}
\pgfpathlineto{\pgfqpoint{0cm}{1.552cm}}
\pgfpathclose
\pgfusepath{clip}
\begin{pgfscope}
\begin{pgfscope}
\pgfpathmoveto{\pgfqpoint{0cm}{-0.035cm}}
\pgfpathlineto{\pgfqpoint{1.376cm}{-0.035cm}}
\pgfpathlineto{\pgfqpoint{1.376cm}{1.552cm}}
\pgfpathlineto{\pgfqpoint{0cm}{1.552cm}}
\pgfpathclose
\pgfusepath{clip}
\begin{pgfscope}
\begin{pgfscope}
\pgfsetdash{}{0cm}
\pgfsetlinewidth{0.818mm}
\pgfsetroundcap
\pgfsetroundjoin
\pgfsetmiterlimit{7.0}
\definecolor{eps2pgf_color}{gray}{0}\pgfsetstrokecolor{eps2pgf_color}\pgfsetfillcolor{eps2pgf_color}
\pgfpathmoveto{\pgfqpoint{0.117cm}{1.421cm}}
\pgfpathlineto{\pgfqpoint{0.682cm}{0.671cm}}
\pgfpathlineto{\pgfqpoint{1.246cm}{1.421cm}}
\pgfusepath{stroke}
\end{pgfscope}
\definecolor{eps2pgf_color}{gray}{0}\pgfsetstrokecolor{eps2pgf_color}\pgfsetfillcolor{eps2pgf_color}
\pgfpathmoveto{\pgfqpoint{0.273cm}{1.395cm}}
\pgfpathcurveto{\pgfqpoint{0.273cm}{1.432cm}}{\pgfqpoint{0.259cm}{1.467cm}}{\pgfqpoint{0.233cm}{1.492cm}}
\pgfpathcurveto{\pgfqpoint{0.207cm}{1.518cm}}{\pgfqpoint{0.173cm}{1.532cm}}{\pgfqpoint{0.137cm}{1.532cm}}
\pgfpathcurveto{\pgfqpoint{0.1cm}{1.532cm}}{\pgfqpoint{0.066cm}{1.518cm}}{\pgfqpoint{0.04cm}{1.492cm}}
\pgfpathcurveto{\pgfqpoint{0.014cm}{1.467cm}}{\pgfqpoint{0cm}{1.432cm}}{\pgfqpoint{0cm}{1.395cm}}
\pgfpathcurveto{\pgfqpoint{0cm}{1.359cm}}{\pgfqpoint{0.014cm}{1.324cm}}{\pgfqpoint{0.04cm}{1.299cm}}
\pgfpathcurveto{\pgfqpoint{0.066cm}{1.273cm}}{\pgfqpoint{0.1cm}{1.258cm}}{\pgfqpoint{0.137cm}{1.258cm}}
\pgfpathcurveto{\pgfqpoint{0.173cm}{1.258cm}}{\pgfqpoint{0.207cm}{1.273cm}}{\pgfqpoint{0.233cm}{1.299cm}}
\pgfpathcurveto{\pgfqpoint{0.259cm}{1.324cm}}{\pgfqpoint{0.273cm}{1.359cm}}{\pgfqpoint{0.273cm}{1.395cm}}
\pgfusepath{fill}
\begin{pgfscope}
\pgfsetdash{}{0cm}
\pgfsetlinewidth{0.818mm}
\pgfsetmiterlimit{7.0}
\pgfpathmoveto{\pgfqpoint{0.682cm}{0.671cm}}
\pgfpathlineto{\pgfqpoint{0.679cm}{1.418cm}}
\pgfusepath{stroke}
\end{pgfscope}
\pgfpathmoveto{\pgfqpoint{0.815cm}{1.399cm}}
\pgfpathcurveto{\pgfqpoint{0.815cm}{1.435cm}}{\pgfqpoint{0.801cm}{1.47cm}}{\pgfqpoint{0.775cm}{1.496cm}}
\pgfpathcurveto{\pgfqpoint{0.75cm}{1.521cm}}{\pgfqpoint{0.715cm}{1.536cm}}{\pgfqpoint{0.679cm}{1.536cm}}
\pgfpathcurveto{\pgfqpoint{0.643cm}{1.536cm}}{\pgfqpoint{0.608cm}{1.521cm}}{\pgfqpoint{0.582cm}{1.496cm}}
\pgfpathcurveto{\pgfqpoint{0.557cm}{1.47cm}}{\pgfqpoint{0.542cm}{1.435cm}}{\pgfqpoint{0.542cm}{1.399cm}}
\pgfpathcurveto{\pgfqpoint{0.542cm}{1.363cm}}{\pgfqpoint{0.557cm}{1.328cm}}{\pgfqpoint{0.582cm}{1.302cm}}
\pgfpathcurveto{\pgfqpoint{0.608cm}{1.276cm}}{\pgfqpoint{0.643cm}{1.262cm}}{\pgfqpoint{0.679cm}{1.262cm}}
\pgfpathcurveto{\pgfqpoint{0.715cm}{1.262cm}}{\pgfqpoint{0.75cm}{1.276cm}}{\pgfqpoint{0.775cm}{1.302cm}}
\pgfpathcurveto{\pgfqpoint{0.801cm}{1.328cm}}{\pgfqpoint{0.815cm}{1.363cm}}{\pgfqpoint{0.815cm}{1.399cm}}
\pgfusepath{fill}
\pgfpathmoveto{\pgfqpoint{1.345cm}{1.371cm}}
\pgfpathcurveto{\pgfqpoint{1.345cm}{1.408cm}}{\pgfqpoint{1.331cm}{1.442cm}}{\pgfqpoint{1.305cm}{1.468cm}}
\pgfpathcurveto{\pgfqpoint{1.28cm}{1.494cm}}{\pgfqpoint{1.245cm}{1.508cm}}{\pgfqpoint{1.209cm}{1.508cm}}
\pgfpathcurveto{\pgfqpoint{1.172cm}{1.508cm}}{\pgfqpoint{1.138cm}{1.494cm}}{\pgfqpoint{1.112cm}{1.468cm}}
\pgfpathcurveto{\pgfqpoint{1.087cm}{1.442cm}}{\pgfqpoint{1.072cm}{1.408cm}}{\pgfqpoint{1.072cm}{1.371cm}}
\pgfpathcurveto{\pgfqpoint{1.072cm}{1.335cm}}{\pgfqpoint{1.087cm}{1.3cm}}{\pgfqpoint{1.112cm}{1.274cm}}
\pgfpathcurveto{\pgfqpoint{1.138cm}{1.249cm}}{\pgfqpoint{1.172cm}{1.234cm}}{\pgfqpoint{1.209cm}{1.234cm}}
\pgfpathcurveto{\pgfqpoint{1.245cm}{1.234cm}}{\pgfqpoint{1.28cm}{1.249cm}}{\pgfqpoint{1.305cm}{1.274cm}}
\pgfpathcurveto{\pgfqpoint{1.331cm}{1.3cm}}{\pgfqpoint{1.345cm}{1.335cm}}{\pgfqpoint{1.345cm}{1.371cm}}
\pgfusepath{fill}
\begin{pgfscope}
\pgfsetdash{}{0cm}
\pgfsetlinewidth{0.818mm}
\pgfsetroundcap
\pgfsetmiterlimit{4.0}
\pgfpathmoveto{\pgfqpoint{0.682cm}{0.671cm}}
\pgfpathlineto{\pgfqpoint{0.682cm}{0.042cm}}
\pgfusepath{stroke}
\end{pgfscope}
\end{pgfscope}
\end{pgfscope}
\end{pgfscope}
\end{tikzpicture}}}+\phi)^3}+\rmb{3(-X^{\!\resizebox{0.6em}{!}{
\begin{tikzpicture}
\pgfpathmoveto{\pgfqpoint{0cm}{-0.035cm}}
\pgfpathlineto{\pgfqpoint{1.376cm}{-0.035cm}}
\pgfpathlineto{\pgfqpoint{1.376cm}{1.552cm}}
\pgfpathlineto{\pgfqpoint{0cm}{1.552cm}}
\pgfpathclose
\pgfusepath{clip}
\begin{pgfscope}
\begin{pgfscope}
\pgfpathmoveto{\pgfqpoint{0cm}{-0.035cm}}
\pgfpathlineto{\pgfqpoint{1.376cm}{-0.035cm}}
\pgfpathlineto{\pgfqpoint{1.376cm}{1.552cm}}
\pgfpathlineto{\pgfqpoint{0cm}{1.552cm}}
\pgfpathclose
\pgfusepath{clip}
\begin{pgfscope}
\begin{pgfscope}
\pgfsetdash{}{0cm}
\pgfsetlinewidth{0.818mm}
\pgfsetroundcap
\pgfsetroundjoin
\pgfsetmiterlimit{7.0}
\definecolor{eps2pgf_color}{gray}{0}\pgfsetstrokecolor{eps2pgf_color}\pgfsetfillcolor{eps2pgf_color}
\pgfpathmoveto{\pgfqpoint{0.117cm}{1.421cm}}
\pgfpathlineto{\pgfqpoint{0.682cm}{0.671cm}}
\pgfpathlineto{\pgfqpoint{1.246cm}{1.421cm}}
\pgfusepath{stroke}
\end{pgfscope}
\definecolor{eps2pgf_color}{gray}{0}\pgfsetstrokecolor{eps2pgf_color}\pgfsetfillcolor{eps2pgf_color}
\pgfpathmoveto{\pgfqpoint{0.273cm}{1.395cm}}
\pgfpathcurveto{\pgfqpoint{0.273cm}{1.432cm}}{\pgfqpoint{0.259cm}{1.467cm}}{\pgfqpoint{0.233cm}{1.492cm}}
\pgfpathcurveto{\pgfqpoint{0.207cm}{1.518cm}}{\pgfqpoint{0.173cm}{1.532cm}}{\pgfqpoint{0.137cm}{1.532cm}}
\pgfpathcurveto{\pgfqpoint{0.1cm}{1.532cm}}{\pgfqpoint{0.066cm}{1.518cm}}{\pgfqpoint{0.04cm}{1.492cm}}
\pgfpathcurveto{\pgfqpoint{0.014cm}{1.467cm}}{\pgfqpoint{0cm}{1.432cm}}{\pgfqpoint{0cm}{1.395cm}}
\pgfpathcurveto{\pgfqpoint{0cm}{1.359cm}}{\pgfqpoint{0.014cm}{1.324cm}}{\pgfqpoint{0.04cm}{1.299cm}}
\pgfpathcurveto{\pgfqpoint{0.066cm}{1.273cm}}{\pgfqpoint{0.1cm}{1.258cm}}{\pgfqpoint{0.137cm}{1.258cm}}
\pgfpathcurveto{\pgfqpoint{0.173cm}{1.258cm}}{\pgfqpoint{0.207cm}{1.273cm}}{\pgfqpoint{0.233cm}{1.299cm}}
\pgfpathcurveto{\pgfqpoint{0.259cm}{1.324cm}}{\pgfqpoint{0.273cm}{1.359cm}}{\pgfqpoint{0.273cm}{1.395cm}}
\pgfusepath{fill}
\begin{pgfscope}
\pgfsetdash{}{0cm}
\pgfsetlinewidth{0.818mm}
\pgfsetmiterlimit{7.0}
\pgfpathmoveto{\pgfqpoint{0.682cm}{0.671cm}}
\pgfpathlineto{\pgfqpoint{0.679cm}{1.418cm}}
\pgfusepath{stroke}
\end{pgfscope}
\pgfpathmoveto{\pgfqpoint{0.815cm}{1.399cm}}
\pgfpathcurveto{\pgfqpoint{0.815cm}{1.435cm}}{\pgfqpoint{0.801cm}{1.47cm}}{\pgfqpoint{0.775cm}{1.496cm}}
\pgfpathcurveto{\pgfqpoint{0.75cm}{1.521cm}}{\pgfqpoint{0.715cm}{1.536cm}}{\pgfqpoint{0.679cm}{1.536cm}}
\pgfpathcurveto{\pgfqpoint{0.643cm}{1.536cm}}{\pgfqpoint{0.608cm}{1.521cm}}{\pgfqpoint{0.582cm}{1.496cm}}
\pgfpathcurveto{\pgfqpoint{0.557cm}{1.47cm}}{\pgfqpoint{0.542cm}{1.435cm}}{\pgfqpoint{0.542cm}{1.399cm}}
\pgfpathcurveto{\pgfqpoint{0.542cm}{1.363cm}}{\pgfqpoint{0.557cm}{1.328cm}}{\pgfqpoint{0.582cm}{1.302cm}}
\pgfpathcurveto{\pgfqpoint{0.608cm}{1.276cm}}{\pgfqpoint{0.643cm}{1.262cm}}{\pgfqpoint{0.679cm}{1.262cm}}
\pgfpathcurveto{\pgfqpoint{0.715cm}{1.262cm}}{\pgfqpoint{0.75cm}{1.276cm}}{\pgfqpoint{0.775cm}{1.302cm}}
\pgfpathcurveto{\pgfqpoint{0.801cm}{1.328cm}}{\pgfqpoint{0.815cm}{1.363cm}}{\pgfqpoint{0.815cm}{1.399cm}}
\pgfusepath{fill}
\pgfpathmoveto{\pgfqpoint{1.345cm}{1.371cm}}
\pgfpathcurveto{\pgfqpoint{1.345cm}{1.408cm}}{\pgfqpoint{1.331cm}{1.442cm}}{\pgfqpoint{1.305cm}{1.468cm}}
\pgfpathcurveto{\pgfqpoint{1.28cm}{1.494cm}}{\pgfqpoint{1.245cm}{1.508cm}}{\pgfqpoint{1.209cm}{1.508cm}}
\pgfpathcurveto{\pgfqpoint{1.172cm}{1.508cm}}{\pgfqpoint{1.138cm}{1.494cm}}{\pgfqpoint{1.112cm}{1.468cm}}
\pgfpathcurveto{\pgfqpoint{1.087cm}{1.442cm}}{\pgfqpoint{1.072cm}{1.408cm}}{\pgfqpoint{1.072cm}{1.371cm}}
\pgfpathcurveto{\pgfqpoint{1.072cm}{1.335cm}}{\pgfqpoint{1.087cm}{1.3cm}}{\pgfqpoint{1.112cm}{1.274cm}}
\pgfpathcurveto{\pgfqpoint{1.138cm}{1.249cm}}{\pgfqpoint{1.172cm}{1.234cm}}{\pgfqpoint{1.209cm}{1.234cm}}
\pgfpathcurveto{\pgfqpoint{1.245cm}{1.234cm}}{\pgfqpoint{1.28cm}{1.249cm}}{\pgfqpoint{1.305cm}{1.274cm}}
\pgfpathcurveto{\pgfqpoint{1.331cm}{1.3cm}}{\pgfqpoint{1.345cm}{1.335cm}}{\pgfqpoint{1.345cm}{1.371cm}}
\pgfusepath{fill}
\begin{pgfscope}
\pgfsetdash{}{0cm}
\pgfsetlinewidth{0.818mm}
\pgfsetroundcap
\pgfsetmiterlimit{4.0}
\pgfpathmoveto{\pgfqpoint{0.682cm}{0.671cm}}
\pgfpathlineto{\pgfqpoint{0.682cm}{0.042cm}}
\pgfusepath{stroke}
\end{pgfscope}
\end{pgfscope}
\end{pgfscope}
\end{pgfscope}
\end{tikzpicture}}}+\phi)^2\psi}+\rmb{3(-X^{\!\resizebox{0.6em}{!}{
\begin{tikzpicture}
\pgfpathmoveto{\pgfqpoint{0cm}{-0.035cm}}
\pgfpathlineto{\pgfqpoint{1.376cm}{-0.035cm}}
\pgfpathlineto{\pgfqpoint{1.376cm}{1.552cm}}
\pgfpathlineto{\pgfqpoint{0cm}{1.552cm}}
\pgfpathclose
\pgfusepath{clip}
\begin{pgfscope}
\begin{pgfscope}
\pgfpathmoveto{\pgfqpoint{0cm}{-0.035cm}}
\pgfpathlineto{\pgfqpoint{1.376cm}{-0.035cm}}
\pgfpathlineto{\pgfqpoint{1.376cm}{1.552cm}}
\pgfpathlineto{\pgfqpoint{0cm}{1.552cm}}
\pgfpathclose
\pgfusepath{clip}
\begin{pgfscope}
\begin{pgfscope}
\pgfsetdash{}{0cm}
\pgfsetlinewidth{0.818mm}
\pgfsetroundcap
\pgfsetroundjoin
\pgfsetmiterlimit{7.0}
\definecolor{eps2pgf_color}{gray}{0}\pgfsetstrokecolor{eps2pgf_color}\pgfsetfillcolor{eps2pgf_color}
\pgfpathmoveto{\pgfqpoint{0.117cm}{1.421cm}}
\pgfpathlineto{\pgfqpoint{0.682cm}{0.671cm}}
\pgfpathlineto{\pgfqpoint{1.246cm}{1.421cm}}
\pgfusepath{stroke}
\end{pgfscope}
\definecolor{eps2pgf_color}{gray}{0}\pgfsetstrokecolor{eps2pgf_color}\pgfsetfillcolor{eps2pgf_color}
\pgfpathmoveto{\pgfqpoint{0.273cm}{1.395cm}}
\pgfpathcurveto{\pgfqpoint{0.273cm}{1.432cm}}{\pgfqpoint{0.259cm}{1.467cm}}{\pgfqpoint{0.233cm}{1.492cm}}
\pgfpathcurveto{\pgfqpoint{0.207cm}{1.518cm}}{\pgfqpoint{0.173cm}{1.532cm}}{\pgfqpoint{0.137cm}{1.532cm}}
\pgfpathcurveto{\pgfqpoint{0.1cm}{1.532cm}}{\pgfqpoint{0.066cm}{1.518cm}}{\pgfqpoint{0.04cm}{1.492cm}}
\pgfpathcurveto{\pgfqpoint{0.014cm}{1.467cm}}{\pgfqpoint{0cm}{1.432cm}}{\pgfqpoint{0cm}{1.395cm}}
\pgfpathcurveto{\pgfqpoint{0cm}{1.359cm}}{\pgfqpoint{0.014cm}{1.324cm}}{\pgfqpoint{0.04cm}{1.299cm}}
\pgfpathcurveto{\pgfqpoint{0.066cm}{1.273cm}}{\pgfqpoint{0.1cm}{1.258cm}}{\pgfqpoint{0.137cm}{1.258cm}}
\pgfpathcurveto{\pgfqpoint{0.173cm}{1.258cm}}{\pgfqpoint{0.207cm}{1.273cm}}{\pgfqpoint{0.233cm}{1.299cm}}
\pgfpathcurveto{\pgfqpoint{0.259cm}{1.324cm}}{\pgfqpoint{0.273cm}{1.359cm}}{\pgfqpoint{0.273cm}{1.395cm}}
\pgfusepath{fill}
\begin{pgfscope}
\pgfsetdash{}{0cm}
\pgfsetlinewidth{0.818mm}
\pgfsetmiterlimit{7.0}
\pgfpathmoveto{\pgfqpoint{0.682cm}{0.671cm}}
\pgfpathlineto{\pgfqpoint{0.679cm}{1.418cm}}
\pgfusepath{stroke}
\end{pgfscope}
\pgfpathmoveto{\pgfqpoint{0.815cm}{1.399cm}}
\pgfpathcurveto{\pgfqpoint{0.815cm}{1.435cm}}{\pgfqpoint{0.801cm}{1.47cm}}{\pgfqpoint{0.775cm}{1.496cm}}
\pgfpathcurveto{\pgfqpoint{0.75cm}{1.521cm}}{\pgfqpoint{0.715cm}{1.536cm}}{\pgfqpoint{0.679cm}{1.536cm}}
\pgfpathcurveto{\pgfqpoint{0.643cm}{1.536cm}}{\pgfqpoint{0.608cm}{1.521cm}}{\pgfqpoint{0.582cm}{1.496cm}}
\pgfpathcurveto{\pgfqpoint{0.557cm}{1.47cm}}{\pgfqpoint{0.542cm}{1.435cm}}{\pgfqpoint{0.542cm}{1.399cm}}
\pgfpathcurveto{\pgfqpoint{0.542cm}{1.363cm}}{\pgfqpoint{0.557cm}{1.328cm}}{\pgfqpoint{0.582cm}{1.302cm}}
\pgfpathcurveto{\pgfqpoint{0.608cm}{1.276cm}}{\pgfqpoint{0.643cm}{1.262cm}}{\pgfqpoint{0.679cm}{1.262cm}}
\pgfpathcurveto{\pgfqpoint{0.715cm}{1.262cm}}{\pgfqpoint{0.75cm}{1.276cm}}{\pgfqpoint{0.775cm}{1.302cm}}
\pgfpathcurveto{\pgfqpoint{0.801cm}{1.328cm}}{\pgfqpoint{0.815cm}{1.363cm}}{\pgfqpoint{0.815cm}{1.399cm}}
\pgfusepath{fill}
\pgfpathmoveto{\pgfqpoint{1.345cm}{1.371cm}}
\pgfpathcurveto{\pgfqpoint{1.345cm}{1.408cm}}{\pgfqpoint{1.331cm}{1.442cm}}{\pgfqpoint{1.305cm}{1.468cm}}
\pgfpathcurveto{\pgfqpoint{1.28cm}{1.494cm}}{\pgfqpoint{1.245cm}{1.508cm}}{\pgfqpoint{1.209cm}{1.508cm}}
\pgfpathcurveto{\pgfqpoint{1.172cm}{1.508cm}}{\pgfqpoint{1.138cm}{1.494cm}}{\pgfqpoint{1.112cm}{1.468cm}}
\pgfpathcurveto{\pgfqpoint{1.087cm}{1.442cm}}{\pgfqpoint{1.072cm}{1.408cm}}{\pgfqpoint{1.072cm}{1.371cm}}
\pgfpathcurveto{\pgfqpoint{1.072cm}{1.335cm}}{\pgfqpoint{1.087cm}{1.3cm}}{\pgfqpoint{1.112cm}{1.274cm}}
\pgfpathcurveto{\pgfqpoint{1.138cm}{1.249cm}}{\pgfqpoint{1.172cm}{1.234cm}}{\pgfqpoint{1.209cm}{1.234cm}}
\pgfpathcurveto{\pgfqpoint{1.245cm}{1.234cm}}{\pgfqpoint{1.28cm}{1.249cm}}{\pgfqpoint{1.305cm}{1.274cm}}
\pgfpathcurveto{\pgfqpoint{1.331cm}{1.3cm}}{\pgfqpoint{1.345cm}{1.335cm}}{\pgfqpoint{1.345cm}{1.371cm}}
\pgfusepath{fill}
\begin{pgfscope}
\pgfsetdash{}{0cm}
\pgfsetlinewidth{0.818mm}
\pgfsetroundcap
\pgfsetmiterlimit{4.0}
\pgfpathmoveto{\pgfqpoint{0.682cm}{0.671cm}}
\pgfpathlineto{\pgfqpoint{0.682cm}{0.042cm}}
\pgfusepath{stroke}
\end{pgfscope}
\end{pgfscope}
\end{pgfscope}
\end{pgfscope}
\end{tikzpicture}}}+\phi)\psi^2}+\psi^3
$$

As the next step, we refine the above decomposition even further. To be more precise, we employ the localization  operators $\UU_{>}
$ and $\UU_{\leqslant}$ such that $\UU_{>}
+\UU_{\leqslant} = \mathrm{Id}$  (see Section \ref{ssec:local} for their construction) and carefully separate certain contributions of the orange terms above. \rmbb{We point out that the localizers depend on a parameter $L>0$ whose precise value will be determined below in Section \ref{ssec:phi1}. Moreover, we will choose different values of $L$ for different stochastic objects while keeping in mind that $\UU_{>}$ and $\UU_{\leq}$ of one object are given by the same parameter $L$ in order to maintain $\UU_{>}
+\UU_{\leqslant} = \mathrm{Id}$.}

\rmbb{Following the  regularity rules outlined at the beginning of Section \ref{s:phi}}, all the orange terms will be written as a sum of  magenta and blue terms, which will lead to our final decomposition.  Namely,
\begin{align*}
&\rmg{3\llbracket X^2 \rrbracket\succ(\phi+\psi)}=\rmm{3\UU_>\llbracket X^2 \rrbracket\succ(\phi+\psi)}+\rmb{3\UU_\leq\llbracket X^2 \rrbracket\succ(\phi+\psi)}
\end{align*}
\begin{align*}
\rmg{3\llbracket X^2 \rrbracket\prec(\phi+\psi)}&=\rmm{3\UU_>\llbracket X^2 \rrbracket\prec(\phi+\psi)}+\rmb{3\UU_\leqslant\llbracket X^2 \rrbracket\prec(\phi+\psi)},
\end{align*}
\begin{align*}
-\rmg{3( - X^{\!\resizebox{0.6em}{!}{
\begin{tikzpicture}
\pgfpathmoveto{\pgfqpoint{0cm}{-0.035cm}}
\pgfpathlineto{\pgfqpoint{1.376cm}{-0.035cm}}
\pgfpathlineto{\pgfqpoint{1.376cm}{1.552cm}}
\pgfpathlineto{\pgfqpoint{0cm}{1.552cm}}
\pgfpathclose
\pgfusepath{clip}
\begin{pgfscope}
\begin{pgfscope}
\pgfpathmoveto{\pgfqpoint{0cm}{-0.035cm}}
\pgfpathlineto{\pgfqpoint{1.376cm}{-0.035cm}}
\pgfpathlineto{\pgfqpoint{1.376cm}{1.552cm}}
\pgfpathlineto{\pgfqpoint{0cm}{1.552cm}}
\pgfpathclose
\pgfusepath{clip}
\begin{pgfscope}
\begin{pgfscope}
\pgfsetdash{}{0cm}
\pgfsetlinewidth{0.818mm}
\pgfsetroundcap
\pgfsetroundjoin
\pgfsetmiterlimit{7.0}
\definecolor{eps2pgf_color}{gray}{0}\pgfsetstrokecolor{eps2pgf_color}\pgfsetfillcolor{eps2pgf_color}
\pgfpathmoveto{\pgfqpoint{0.117cm}{1.421cm}}
\pgfpathlineto{\pgfqpoint{0.682cm}{0.671cm}}
\pgfpathlineto{\pgfqpoint{1.246cm}{1.421cm}}
\pgfusepath{stroke}
\end{pgfscope}
\definecolor{eps2pgf_color}{gray}{0}\pgfsetstrokecolor{eps2pgf_color}\pgfsetfillcolor{eps2pgf_color}
\pgfpathmoveto{\pgfqpoint{0.273cm}{1.395cm}}
\pgfpathcurveto{\pgfqpoint{0.273cm}{1.432cm}}{\pgfqpoint{0.259cm}{1.467cm}}{\pgfqpoint{0.233cm}{1.492cm}}
\pgfpathcurveto{\pgfqpoint{0.207cm}{1.518cm}}{\pgfqpoint{0.173cm}{1.532cm}}{\pgfqpoint{0.137cm}{1.532cm}}
\pgfpathcurveto{\pgfqpoint{0.1cm}{1.532cm}}{\pgfqpoint{0.066cm}{1.518cm}}{\pgfqpoint{0.04cm}{1.492cm}}
\pgfpathcurveto{\pgfqpoint{0.014cm}{1.467cm}}{\pgfqpoint{0cm}{1.432cm}}{\pgfqpoint{0cm}{1.395cm}}
\pgfpathcurveto{\pgfqpoint{0cm}{1.359cm}}{\pgfqpoint{0.014cm}{1.324cm}}{\pgfqpoint{0.04cm}{1.299cm}}
\pgfpathcurveto{\pgfqpoint{0.066cm}{1.273cm}}{\pgfqpoint{0.1cm}{1.258cm}}{\pgfqpoint{0.137cm}{1.258cm}}
\pgfpathcurveto{\pgfqpoint{0.173cm}{1.258cm}}{\pgfqpoint{0.207cm}{1.273cm}}{\pgfqpoint{0.233cm}{1.299cm}}
\pgfpathcurveto{\pgfqpoint{0.259cm}{1.324cm}}{\pgfqpoint{0.273cm}{1.359cm}}{\pgfqpoint{0.273cm}{1.395cm}}
\pgfusepath{fill}
\begin{pgfscope}
\pgfsetdash{}{0cm}
\pgfsetlinewidth{0.818mm}
\pgfsetmiterlimit{7.0}
\pgfpathmoveto{\pgfqpoint{0.682cm}{0.671cm}}
\pgfpathlineto{\pgfqpoint{0.679cm}{1.418cm}}
\pgfusepath{stroke}
\end{pgfscope}
\pgfpathmoveto{\pgfqpoint{0.815cm}{1.399cm}}
\pgfpathcurveto{\pgfqpoint{0.815cm}{1.435cm}}{\pgfqpoint{0.801cm}{1.47cm}}{\pgfqpoint{0.775cm}{1.496cm}}
\pgfpathcurveto{\pgfqpoint{0.75cm}{1.521cm}}{\pgfqpoint{0.715cm}{1.536cm}}{\pgfqpoint{0.679cm}{1.536cm}}
\pgfpathcurveto{\pgfqpoint{0.643cm}{1.536cm}}{\pgfqpoint{0.608cm}{1.521cm}}{\pgfqpoint{0.582cm}{1.496cm}}
\pgfpathcurveto{\pgfqpoint{0.557cm}{1.47cm}}{\pgfqpoint{0.542cm}{1.435cm}}{\pgfqpoint{0.542cm}{1.399cm}}
\pgfpathcurveto{\pgfqpoint{0.542cm}{1.363cm}}{\pgfqpoint{0.557cm}{1.328cm}}{\pgfqpoint{0.582cm}{1.302cm}}
\pgfpathcurveto{\pgfqpoint{0.608cm}{1.276cm}}{\pgfqpoint{0.643cm}{1.262cm}}{\pgfqpoint{0.679cm}{1.262cm}}
\pgfpathcurveto{\pgfqpoint{0.715cm}{1.262cm}}{\pgfqpoint{0.75cm}{1.276cm}}{\pgfqpoint{0.775cm}{1.302cm}}
\pgfpathcurveto{\pgfqpoint{0.801cm}{1.328cm}}{\pgfqpoint{0.815cm}{1.363cm}}{\pgfqpoint{0.815cm}{1.399cm}}
\pgfusepath{fill}
\pgfpathmoveto{\pgfqpoint{1.345cm}{1.371cm}}
\pgfpathcurveto{\pgfqpoint{1.345cm}{1.408cm}}{\pgfqpoint{1.331cm}{1.442cm}}{\pgfqpoint{1.305cm}{1.468cm}}
\pgfpathcurveto{\pgfqpoint{1.28cm}{1.494cm}}{\pgfqpoint{1.245cm}{1.508cm}}{\pgfqpoint{1.209cm}{1.508cm}}
\pgfpathcurveto{\pgfqpoint{1.172cm}{1.508cm}}{\pgfqpoint{1.138cm}{1.494cm}}{\pgfqpoint{1.112cm}{1.468cm}}
\pgfpathcurveto{\pgfqpoint{1.087cm}{1.442cm}}{\pgfqpoint{1.072cm}{1.408cm}}{\pgfqpoint{1.072cm}{1.371cm}}
\pgfpathcurveto{\pgfqpoint{1.072cm}{1.335cm}}{\pgfqpoint{1.087cm}{1.3cm}}{\pgfqpoint{1.112cm}{1.274cm}}
\pgfpathcurveto{\pgfqpoint{1.138cm}{1.249cm}}{\pgfqpoint{1.172cm}{1.234cm}}{\pgfqpoint{1.209cm}{1.234cm}}
\pgfpathcurveto{\pgfqpoint{1.245cm}{1.234cm}}{\pgfqpoint{1.28cm}{1.249cm}}{\pgfqpoint{1.305cm}{1.274cm}}
\pgfpathcurveto{\pgfqpoint{1.331cm}{1.3cm}}{\pgfqpoint{1.345cm}{1.335cm}}{\pgfqpoint{1.345cm}{1.371cm}}
\pgfusepath{fill}
\begin{pgfscope}
\pgfsetdash{}{0cm}
\pgfsetlinewidth{0.818mm}
\pgfsetroundcap
\pgfsetmiterlimit{4.0}
\pgfpathmoveto{\pgfqpoint{0.682cm}{0.671cm}}
\pgfpathlineto{\pgfqpoint{0.682cm}{0.042cm}}
\pgfusepath{stroke}
\end{pgfscope}
\end{pgfscope}
\end{pgfscope}
\end{pgfscope}
\end{tikzpicture}}} + \phi + \psi)X^{\!\resizebox{!}{.8em}{
\begin{tikzpicture}
\pgfpathmoveto{\pgfqpoint{0cm}{-0.035cm}}
\pgfpathlineto{\pgfqpoint{1.976cm}{-0.035cm}}
\pgfpathlineto{\pgfqpoint{1.976cm}{1.94cm}}
\pgfpathlineto{\pgfqpoint{0cm}{1.94cm}}
\pgfpathclose
\pgfusepath{clip}
\begin{pgfscope}
\begin{pgfscope}
\pgfpathmoveto{\pgfqpoint{0cm}{-0.035cm}}
\pgfpathlineto{\pgfqpoint{1.976cm}{-0.035cm}}
\pgfpathlineto{\pgfqpoint{1.976cm}{1.94cm}}
\pgfpathlineto{\pgfqpoint{0cm}{1.94cm}}
\pgfpathclose
\pgfusepath{clip}
\begin{pgfscope}
\begin{pgfscope}
\pgfsetdash{}{0cm}
\pgfsetlinewidth{0.818mm}
\pgfsetroundcap
\pgfsetroundjoin
\pgfsetmiterlimit{7.0}
\definecolor{eps2pgf_color}{gray}{0}\pgfsetstrokecolor{eps2pgf_color}\pgfsetfillcolor{eps2pgf_color}
\pgfpathmoveto{\pgfqpoint{0.117cm}{1.815cm}}
\pgfpathlineto{\pgfqpoint{0.682cm}{1.065cm}}
\pgfpathlineto{\pgfqpoint{1.246cm}{1.815cm}}
\pgfusepath{stroke}
\end{pgfscope}
\definecolor{eps2pgf_color}{gray}{0}\pgfsetstrokecolor{eps2pgf_color}\pgfsetfillcolor{eps2pgf_color}
\pgfpathmoveto{\pgfqpoint{0.273cm}{1.789cm}}
\pgfpathcurveto{\pgfqpoint{0.273cm}{1.825cm}}{\pgfqpoint{0.259cm}{1.86cm}}{\pgfqpoint{0.233cm}{1.886cm}}
\pgfpathcurveto{\pgfqpoint{0.207cm}{1.912cm}}{\pgfqpoint{0.173cm}{1.926cm}}{\pgfqpoint{0.137cm}{1.926cm}}
\pgfpathcurveto{\pgfqpoint{0.1cm}{1.926cm}}{\pgfqpoint{0.066cm}{1.912cm}}{\pgfqpoint{0.04cm}{1.886cm}}
\pgfpathcurveto{\pgfqpoint{0.014cm}{1.86cm}}{\pgfqpoint{0cm}{1.825cm}}{\pgfqpoint{0cm}{1.789cm}}
\pgfpathcurveto{\pgfqpoint{0cm}{1.753cm}}{\pgfqpoint{0.014cm}{1.718cm}}{\pgfqpoint{0.04cm}{1.692cm}}
\pgfpathcurveto{\pgfqpoint{0.066cm}{1.667cm}}{\pgfqpoint{0.1cm}{1.652cm}}{\pgfqpoint{0.137cm}{1.652cm}}
\pgfpathcurveto{\pgfqpoint{0.173cm}{1.652cm}}{\pgfqpoint{0.207cm}{1.667cm}}{\pgfqpoint{0.233cm}{1.692cm}}
\pgfpathcurveto{\pgfqpoint{0.259cm}{1.718cm}}{\pgfqpoint{0.273cm}{1.753cm}}{\pgfqpoint{0.273cm}{1.789cm}}
\pgfusepath{fill}
\pgfpathmoveto{\pgfqpoint{1.345cm}{1.765cm}}
\pgfpathcurveto{\pgfqpoint{1.345cm}{1.801cm}}{\pgfqpoint{1.331cm}{1.836cm}}{\pgfqpoint{1.305cm}{1.862cm}}
\pgfpathcurveto{\pgfqpoint{1.28cm}{1.887cm}}{\pgfqpoint{1.245cm}{1.902cm}}{\pgfqpoint{1.209cm}{1.902cm}}
\pgfpathcurveto{\pgfqpoint{1.172cm}{1.902cm}}{\pgfqpoint{1.138cm}{1.887cm}}{\pgfqpoint{1.112cm}{1.862cm}}
\pgfpathcurveto{\pgfqpoint{1.087cm}{1.836cm}}{\pgfqpoint{1.072cm}{1.801cm}}{\pgfqpoint{1.072cm}{1.765cm}}
\pgfpathcurveto{\pgfqpoint{1.072cm}{1.728cm}}{\pgfqpoint{1.087cm}{1.694cm}}{\pgfqpoint{1.112cm}{1.668cm}}
\pgfpathcurveto{\pgfqpoint{1.138cm}{1.642cm}}{\pgfqpoint{1.172cm}{1.628cm}}{\pgfqpoint{1.209cm}{1.628cm}}
\pgfpathcurveto{\pgfqpoint{1.245cm}{1.628cm}}{\pgfqpoint{1.28cm}{1.642cm}}{\pgfqpoint{1.305cm}{1.668cm}}
\pgfpathcurveto{\pgfqpoint{1.331cm}{1.694cm}}{\pgfqpoint{1.345cm}{1.728cm}}{\pgfqpoint{1.345cm}{1.765cm}}
\pgfusepath{fill}
\begin{pgfscope}
\pgfsetdash{}{0cm}
\pgfsetlinewidth{0.818mm}
\pgfsetroundcap
\pgfsetroundjoin
\pgfsetmiterlimit{7.0}
\pgfpathmoveto{\pgfqpoint{0.682cm}{1.065cm}}
\pgfpathlineto{\pgfqpoint{1.246cm}{0.315cm}}
\pgfpathlineto{\pgfqpoint{1.811cm}{1.065cm}}
\pgfusepath{stroke}
\end{pgfscope}
\pgfpathmoveto{\pgfqpoint{1.948cm}{1.065cm}}
\pgfpathcurveto{\pgfqpoint{1.948cm}{1.101cm}}{\pgfqpoint{1.933cm}{1.136cm}}{\pgfqpoint{1.907cm}{1.162cm}}
\pgfpathcurveto{\pgfqpoint{1.882cm}{1.187cm}}{\pgfqpoint{1.847cm}{1.202cm}}{\pgfqpoint{1.811cm}{1.202cm}}
\pgfpathcurveto{\pgfqpoint{1.775cm}{1.202cm}}{\pgfqpoint{1.74cm}{1.187cm}}{\pgfqpoint{1.714cm}{1.162cm}}
\pgfpathcurveto{\pgfqpoint{1.689cm}{1.136cm}}{\pgfqpoint{1.674cm}{1.101cm}}{\pgfqpoint{1.674cm}{1.065cm}}
\pgfpathcurveto{\pgfqpoint{1.674cm}{1.029cm}}{\pgfqpoint{1.689cm}{0.994cm}}{\pgfqpoint{1.714cm}{0.968cm}}
\pgfpathcurveto{\pgfqpoint{1.74cm}{0.942cm}}{\pgfqpoint{1.775cm}{0.928cm}}{\pgfqpoint{1.811cm}{0.928cm}}
\pgfpathcurveto{\pgfqpoint{1.847cm}{0.928cm}}{\pgfqpoint{1.882cm}{0.942cm}}{\pgfqpoint{1.907cm}{0.968cm}}
\pgfpathcurveto{\pgfqpoint{1.933cm}{0.994cm}}{\pgfqpoint{1.948cm}{1.029cm}}{\pgfqpoint{1.948cm}{1.065cm}}
\pgfusepath{fill}
\begin{pgfscope}
\pgfsetdash{}{0cm}
\pgfsetlinewidth{0.818mm}
\pgfsetmiterlimit{7.0}
\pgfpathmoveto{\pgfqpoint{1.246cm}{0.315cm}}
\pgfpathlineto{\pgfqpoint{1.244cm}{1.061cm}}
\pgfusepath{stroke}
\end{pgfscope}
\pgfpathmoveto{\pgfqpoint{1.38cm}{1.065cm}}
\pgfpathcurveto{\pgfqpoint{1.38cm}{1.101cm}}{\pgfqpoint{1.366cm}{1.136cm}}{\pgfqpoint{1.34cm}{1.162cm}}
\pgfpathcurveto{\pgfqpoint{1.315cm}{1.187cm}}{\pgfqpoint{1.28cm}{1.202cm}}{\pgfqpoint{1.244cm}{1.202cm}}
\pgfpathcurveto{\pgfqpoint{1.207cm}{1.202cm}}{\pgfqpoint{1.173cm}{1.187cm}}{\pgfqpoint{1.147cm}{1.162cm}}
\pgfpathcurveto{\pgfqpoint{1.121cm}{1.136cm}}{\pgfqpoint{1.107cm}{1.101cm}}{\pgfqpoint{1.107cm}{1.065cm}}
\pgfpathcurveto{\pgfqpoint{1.107cm}{1.029cm}}{\pgfqpoint{1.121cm}{0.994cm}}{\pgfqpoint{1.147cm}{0.968cm}}
\pgfpathcurveto{\pgfqpoint{1.173cm}{0.942cm}}{\pgfqpoint{1.207cm}{0.928cm}}{\pgfqpoint{1.244cm}{0.928cm}}
\pgfpathcurveto{\pgfqpoint{1.28cm}{0.928cm}}{\pgfqpoint{1.315cm}{0.942cm}}{\pgfqpoint{1.34cm}{0.968cm}}
\pgfpathcurveto{\pgfqpoint{1.366cm}{0.994cm}}{\pgfqpoint{1.38cm}{1.029cm}}{\pgfqpoint{1.38cm}{1.065cm}}
\pgfusepath{fill}
\begin{pgfscope}
\pgfsetdash{}{0cm}
\pgfsetlinewidth{0.818mm}
\pgfsetmiterlimit{4.0}
\pgfpathmoveto{\pgfqpoint{1.383cm}{0.178cm}}
\pgfpathcurveto{\pgfqpoint{1.383cm}{0.214cm}}{\pgfqpoint{1.369cm}{0.249cm}}{\pgfqpoint{1.343cm}{0.275cm}}
\pgfpathcurveto{\pgfqpoint{1.317cm}{0.3cm}}{\pgfqpoint{1.283cm}{0.315cm}}{\pgfqpoint{1.246cm}{0.315cm}}
\pgfpathcurveto{\pgfqpoint{1.21cm}{0.315cm}}{\pgfqpoint{1.175cm}{0.3cm}}{\pgfqpoint{1.15cm}{0.275cm}}
\pgfpathcurveto{\pgfqpoint{1.124cm}{0.249cm}}{\pgfqpoint{1.11cm}{0.214cm}}{\pgfqpoint{1.11cm}{0.178cm}}
\pgfpathcurveto{\pgfqpoint{1.11cm}{0.141cm}}{\pgfqpoint{1.124cm}{0.107cm}}{\pgfqpoint{1.15cm}{0.081cm}}
\pgfpathcurveto{\pgfqpoint{1.175cm}{0.055cm}}{\pgfqpoint{1.21cm}{0.041cm}}{\pgfqpoint{1.246cm}{0.041cm}}
\pgfpathcurveto{\pgfqpoint{1.283cm}{0.041cm}}{\pgfqpoint{1.317cm}{0.055cm}}{\pgfqpoint{1.343cm}{0.081cm}}
\pgfpathcurveto{\pgfqpoint{1.369cm}{0.107cm}}{\pgfqpoint{1.383cm}{0.141cm}}{\pgfqpoint{1.383cm}{0.178cm}}
\pgfusepath{stroke}
\end{pgfscope}
\end{pgfscope}
\end{pgfscope}
\end{pgfscope}
\end{tikzpicture}}}}&=\rmm{3X^{\!\resizebox{0.6em}{!}{
\begin{tikzpicture}
\pgfpathmoveto{\pgfqpoint{0cm}{-0.035cm}}
\pgfpathlineto{\pgfqpoint{1.376cm}{-0.035cm}}
\pgfpathlineto{\pgfqpoint{1.376cm}{1.552cm}}
\pgfpathlineto{\pgfqpoint{0cm}{1.552cm}}
\pgfpathclose
\pgfusepath{clip}
\begin{pgfscope}
\begin{pgfscope}
\pgfpathmoveto{\pgfqpoint{0cm}{-0.035cm}}
\pgfpathlineto{\pgfqpoint{1.376cm}{-0.035cm}}
\pgfpathlineto{\pgfqpoint{1.376cm}{1.552cm}}
\pgfpathlineto{\pgfqpoint{0cm}{1.552cm}}
\pgfpathclose
\pgfusepath{clip}
\begin{pgfscope}
\begin{pgfscope}
\pgfsetdash{}{0cm}
\pgfsetlinewidth{0.818mm}
\pgfsetroundcap
\pgfsetroundjoin
\pgfsetmiterlimit{7.0}
\definecolor{eps2pgf_color}{gray}{0}\pgfsetstrokecolor{eps2pgf_color}\pgfsetfillcolor{eps2pgf_color}
\pgfpathmoveto{\pgfqpoint{0.117cm}{1.421cm}}
\pgfpathlineto{\pgfqpoint{0.682cm}{0.671cm}}
\pgfpathlineto{\pgfqpoint{1.246cm}{1.421cm}}
\pgfusepath{stroke}
\end{pgfscope}
\definecolor{eps2pgf_color}{gray}{0}\pgfsetstrokecolor{eps2pgf_color}\pgfsetfillcolor{eps2pgf_color}
\pgfpathmoveto{\pgfqpoint{0.273cm}{1.395cm}}
\pgfpathcurveto{\pgfqpoint{0.273cm}{1.432cm}}{\pgfqpoint{0.259cm}{1.467cm}}{\pgfqpoint{0.233cm}{1.492cm}}
\pgfpathcurveto{\pgfqpoint{0.207cm}{1.518cm}}{\pgfqpoint{0.173cm}{1.532cm}}{\pgfqpoint{0.137cm}{1.532cm}}
\pgfpathcurveto{\pgfqpoint{0.1cm}{1.532cm}}{\pgfqpoint{0.066cm}{1.518cm}}{\pgfqpoint{0.04cm}{1.492cm}}
\pgfpathcurveto{\pgfqpoint{0.014cm}{1.467cm}}{\pgfqpoint{0cm}{1.432cm}}{\pgfqpoint{0cm}{1.395cm}}
\pgfpathcurveto{\pgfqpoint{0cm}{1.359cm}}{\pgfqpoint{0.014cm}{1.324cm}}{\pgfqpoint{0.04cm}{1.299cm}}
\pgfpathcurveto{\pgfqpoint{0.066cm}{1.273cm}}{\pgfqpoint{0.1cm}{1.258cm}}{\pgfqpoint{0.137cm}{1.258cm}}
\pgfpathcurveto{\pgfqpoint{0.173cm}{1.258cm}}{\pgfqpoint{0.207cm}{1.273cm}}{\pgfqpoint{0.233cm}{1.299cm}}
\pgfpathcurveto{\pgfqpoint{0.259cm}{1.324cm}}{\pgfqpoint{0.273cm}{1.359cm}}{\pgfqpoint{0.273cm}{1.395cm}}
\pgfusepath{fill}
\begin{pgfscope}
\pgfsetdash{}{0cm}
\pgfsetlinewidth{0.818mm}
\pgfsetmiterlimit{7.0}
\pgfpathmoveto{\pgfqpoint{0.682cm}{0.671cm}}
\pgfpathlineto{\pgfqpoint{0.679cm}{1.418cm}}
\pgfusepath{stroke}
\end{pgfscope}
\pgfpathmoveto{\pgfqpoint{0.815cm}{1.399cm}}
\pgfpathcurveto{\pgfqpoint{0.815cm}{1.435cm}}{\pgfqpoint{0.801cm}{1.47cm}}{\pgfqpoint{0.775cm}{1.496cm}}
\pgfpathcurveto{\pgfqpoint{0.75cm}{1.521cm}}{\pgfqpoint{0.715cm}{1.536cm}}{\pgfqpoint{0.679cm}{1.536cm}}
\pgfpathcurveto{\pgfqpoint{0.643cm}{1.536cm}}{\pgfqpoint{0.608cm}{1.521cm}}{\pgfqpoint{0.582cm}{1.496cm}}
\pgfpathcurveto{\pgfqpoint{0.557cm}{1.47cm}}{\pgfqpoint{0.542cm}{1.435cm}}{\pgfqpoint{0.542cm}{1.399cm}}
\pgfpathcurveto{\pgfqpoint{0.542cm}{1.363cm}}{\pgfqpoint{0.557cm}{1.328cm}}{\pgfqpoint{0.582cm}{1.302cm}}
\pgfpathcurveto{\pgfqpoint{0.608cm}{1.276cm}}{\pgfqpoint{0.643cm}{1.262cm}}{\pgfqpoint{0.679cm}{1.262cm}}
\pgfpathcurveto{\pgfqpoint{0.715cm}{1.262cm}}{\pgfqpoint{0.75cm}{1.276cm}}{\pgfqpoint{0.775cm}{1.302cm}}
\pgfpathcurveto{\pgfqpoint{0.801cm}{1.328cm}}{\pgfqpoint{0.815cm}{1.363cm}}{\pgfqpoint{0.815cm}{1.399cm}}
\pgfusepath{fill}
\pgfpathmoveto{\pgfqpoint{1.345cm}{1.371cm}}
\pgfpathcurveto{\pgfqpoint{1.345cm}{1.408cm}}{\pgfqpoint{1.331cm}{1.442cm}}{\pgfqpoint{1.305cm}{1.468cm}}
\pgfpathcurveto{\pgfqpoint{1.28cm}{1.494cm}}{\pgfqpoint{1.245cm}{1.508cm}}{\pgfqpoint{1.209cm}{1.508cm}}
\pgfpathcurveto{\pgfqpoint{1.172cm}{1.508cm}}{\pgfqpoint{1.138cm}{1.494cm}}{\pgfqpoint{1.112cm}{1.468cm}}
\pgfpathcurveto{\pgfqpoint{1.087cm}{1.442cm}}{\pgfqpoint{1.072cm}{1.408cm}}{\pgfqpoint{1.072cm}{1.371cm}}
\pgfpathcurveto{\pgfqpoint{1.072cm}{1.335cm}}{\pgfqpoint{1.087cm}{1.3cm}}{\pgfqpoint{1.112cm}{1.274cm}}
\pgfpathcurveto{\pgfqpoint{1.138cm}{1.249cm}}{\pgfqpoint{1.172cm}{1.234cm}}{\pgfqpoint{1.209cm}{1.234cm}}
\pgfpathcurveto{\pgfqpoint{1.245cm}{1.234cm}}{\pgfqpoint{1.28cm}{1.249cm}}{\pgfqpoint{1.305cm}{1.274cm}}
\pgfpathcurveto{\pgfqpoint{1.331cm}{1.3cm}}{\pgfqpoint{1.345cm}{1.335cm}}{\pgfqpoint{1.345cm}{1.371cm}}
\pgfusepath{fill}
\begin{pgfscope}
\pgfsetdash{}{0cm}
\pgfsetlinewidth{0.818mm}
\pgfsetroundcap
\pgfsetmiterlimit{4.0}
\pgfpathmoveto{\pgfqpoint{0.682cm}{0.671cm}}
\pgfpathlineto{\pgfqpoint{0.682cm}{0.042cm}}
\pgfusepath{stroke}
\end{pgfscope}
\end{pgfscope}
\end{pgfscope}
\end{pgfscope}
\end{tikzpicture}}}X^{\!\resizebox{!}{.8em}{
\begin{tikzpicture}
\pgfpathmoveto{\pgfqpoint{0cm}{-0.035cm}}
\pgfpathlineto{\pgfqpoint{1.976cm}{-0.035cm}}
\pgfpathlineto{\pgfqpoint{1.976cm}{1.94cm}}
\pgfpathlineto{\pgfqpoint{0cm}{1.94cm}}
\pgfpathclose
\pgfusepath{clip}
\begin{pgfscope}
\begin{pgfscope}
\pgfpathmoveto{\pgfqpoint{0cm}{-0.035cm}}
\pgfpathlineto{\pgfqpoint{1.976cm}{-0.035cm}}
\pgfpathlineto{\pgfqpoint{1.976cm}{1.94cm}}
\pgfpathlineto{\pgfqpoint{0cm}{1.94cm}}
\pgfpathclose
\pgfusepath{clip}
\begin{pgfscope}
\begin{pgfscope}
\pgfsetdash{}{0cm}
\pgfsetlinewidth{0.818mm}
\pgfsetroundcap
\pgfsetroundjoin
\pgfsetmiterlimit{7.0}
\definecolor{eps2pgf_color}{gray}{0}\pgfsetstrokecolor{eps2pgf_color}\pgfsetfillcolor{eps2pgf_color}
\pgfpathmoveto{\pgfqpoint{0.117cm}{1.815cm}}
\pgfpathlineto{\pgfqpoint{0.682cm}{1.065cm}}
\pgfpathlineto{\pgfqpoint{1.246cm}{1.815cm}}
\pgfusepath{stroke}
\end{pgfscope}
\definecolor{eps2pgf_color}{gray}{0}\pgfsetstrokecolor{eps2pgf_color}\pgfsetfillcolor{eps2pgf_color}
\pgfpathmoveto{\pgfqpoint{0.273cm}{1.789cm}}
\pgfpathcurveto{\pgfqpoint{0.273cm}{1.825cm}}{\pgfqpoint{0.259cm}{1.86cm}}{\pgfqpoint{0.233cm}{1.886cm}}
\pgfpathcurveto{\pgfqpoint{0.207cm}{1.912cm}}{\pgfqpoint{0.173cm}{1.926cm}}{\pgfqpoint{0.137cm}{1.926cm}}
\pgfpathcurveto{\pgfqpoint{0.1cm}{1.926cm}}{\pgfqpoint{0.066cm}{1.912cm}}{\pgfqpoint{0.04cm}{1.886cm}}
\pgfpathcurveto{\pgfqpoint{0.014cm}{1.86cm}}{\pgfqpoint{0cm}{1.825cm}}{\pgfqpoint{0cm}{1.789cm}}
\pgfpathcurveto{\pgfqpoint{0cm}{1.753cm}}{\pgfqpoint{0.014cm}{1.718cm}}{\pgfqpoint{0.04cm}{1.692cm}}
\pgfpathcurveto{\pgfqpoint{0.066cm}{1.667cm}}{\pgfqpoint{0.1cm}{1.652cm}}{\pgfqpoint{0.137cm}{1.652cm}}
\pgfpathcurveto{\pgfqpoint{0.173cm}{1.652cm}}{\pgfqpoint{0.207cm}{1.667cm}}{\pgfqpoint{0.233cm}{1.692cm}}
\pgfpathcurveto{\pgfqpoint{0.259cm}{1.718cm}}{\pgfqpoint{0.273cm}{1.753cm}}{\pgfqpoint{0.273cm}{1.789cm}}
\pgfusepath{fill}
\pgfpathmoveto{\pgfqpoint{1.345cm}{1.765cm}}
\pgfpathcurveto{\pgfqpoint{1.345cm}{1.801cm}}{\pgfqpoint{1.331cm}{1.836cm}}{\pgfqpoint{1.305cm}{1.862cm}}
\pgfpathcurveto{\pgfqpoint{1.28cm}{1.887cm}}{\pgfqpoint{1.245cm}{1.902cm}}{\pgfqpoint{1.209cm}{1.902cm}}
\pgfpathcurveto{\pgfqpoint{1.172cm}{1.902cm}}{\pgfqpoint{1.138cm}{1.887cm}}{\pgfqpoint{1.112cm}{1.862cm}}
\pgfpathcurveto{\pgfqpoint{1.087cm}{1.836cm}}{\pgfqpoint{1.072cm}{1.801cm}}{\pgfqpoint{1.072cm}{1.765cm}}
\pgfpathcurveto{\pgfqpoint{1.072cm}{1.728cm}}{\pgfqpoint{1.087cm}{1.694cm}}{\pgfqpoint{1.112cm}{1.668cm}}
\pgfpathcurveto{\pgfqpoint{1.138cm}{1.642cm}}{\pgfqpoint{1.172cm}{1.628cm}}{\pgfqpoint{1.209cm}{1.628cm}}
\pgfpathcurveto{\pgfqpoint{1.245cm}{1.628cm}}{\pgfqpoint{1.28cm}{1.642cm}}{\pgfqpoint{1.305cm}{1.668cm}}
\pgfpathcurveto{\pgfqpoint{1.331cm}{1.694cm}}{\pgfqpoint{1.345cm}{1.728cm}}{\pgfqpoint{1.345cm}{1.765cm}}
\pgfusepath{fill}
\begin{pgfscope}
\pgfsetdash{}{0cm}
\pgfsetlinewidth{0.818mm}
\pgfsetroundcap
\pgfsetroundjoin
\pgfsetmiterlimit{7.0}
\pgfpathmoveto{\pgfqpoint{0.682cm}{1.065cm}}
\pgfpathlineto{\pgfqpoint{1.246cm}{0.315cm}}
\pgfpathlineto{\pgfqpoint{1.811cm}{1.065cm}}
\pgfusepath{stroke}
\end{pgfscope}
\pgfpathmoveto{\pgfqpoint{1.948cm}{1.065cm}}
\pgfpathcurveto{\pgfqpoint{1.948cm}{1.101cm}}{\pgfqpoint{1.933cm}{1.136cm}}{\pgfqpoint{1.907cm}{1.162cm}}
\pgfpathcurveto{\pgfqpoint{1.882cm}{1.187cm}}{\pgfqpoint{1.847cm}{1.202cm}}{\pgfqpoint{1.811cm}{1.202cm}}
\pgfpathcurveto{\pgfqpoint{1.775cm}{1.202cm}}{\pgfqpoint{1.74cm}{1.187cm}}{\pgfqpoint{1.714cm}{1.162cm}}
\pgfpathcurveto{\pgfqpoint{1.689cm}{1.136cm}}{\pgfqpoint{1.674cm}{1.101cm}}{\pgfqpoint{1.674cm}{1.065cm}}
\pgfpathcurveto{\pgfqpoint{1.674cm}{1.029cm}}{\pgfqpoint{1.689cm}{0.994cm}}{\pgfqpoint{1.714cm}{0.968cm}}
\pgfpathcurveto{\pgfqpoint{1.74cm}{0.942cm}}{\pgfqpoint{1.775cm}{0.928cm}}{\pgfqpoint{1.811cm}{0.928cm}}
\pgfpathcurveto{\pgfqpoint{1.847cm}{0.928cm}}{\pgfqpoint{1.882cm}{0.942cm}}{\pgfqpoint{1.907cm}{0.968cm}}
\pgfpathcurveto{\pgfqpoint{1.933cm}{0.994cm}}{\pgfqpoint{1.948cm}{1.029cm}}{\pgfqpoint{1.948cm}{1.065cm}}
\pgfusepath{fill}
\begin{pgfscope}
\pgfsetdash{}{0cm}
\pgfsetlinewidth{0.818mm}
\pgfsetmiterlimit{7.0}
\pgfpathmoveto{\pgfqpoint{1.246cm}{0.315cm}}
\pgfpathlineto{\pgfqpoint{1.244cm}{1.061cm}}
\pgfusepath{stroke}
\end{pgfscope}
\pgfpathmoveto{\pgfqpoint{1.38cm}{1.065cm}}
\pgfpathcurveto{\pgfqpoint{1.38cm}{1.101cm}}{\pgfqpoint{1.366cm}{1.136cm}}{\pgfqpoint{1.34cm}{1.162cm}}
\pgfpathcurveto{\pgfqpoint{1.315cm}{1.187cm}}{\pgfqpoint{1.28cm}{1.202cm}}{\pgfqpoint{1.244cm}{1.202cm}}
\pgfpathcurveto{\pgfqpoint{1.207cm}{1.202cm}}{\pgfqpoint{1.173cm}{1.187cm}}{\pgfqpoint{1.147cm}{1.162cm}}
\pgfpathcurveto{\pgfqpoint{1.121cm}{1.136cm}}{\pgfqpoint{1.107cm}{1.101cm}}{\pgfqpoint{1.107cm}{1.065cm}}
\pgfpathcurveto{\pgfqpoint{1.107cm}{1.029cm}}{\pgfqpoint{1.121cm}{0.994cm}}{\pgfqpoint{1.147cm}{0.968cm}}
\pgfpathcurveto{\pgfqpoint{1.173cm}{0.942cm}}{\pgfqpoint{1.207cm}{0.928cm}}{\pgfqpoint{1.244cm}{0.928cm}}
\pgfpathcurveto{\pgfqpoint{1.28cm}{0.928cm}}{\pgfqpoint{1.315cm}{0.942cm}}{\pgfqpoint{1.34cm}{0.968cm}}
\pgfpathcurveto{\pgfqpoint{1.366cm}{0.994cm}}{\pgfqpoint{1.38cm}{1.029cm}}{\pgfqpoint{1.38cm}{1.065cm}}
\pgfusepath{fill}
\begin{pgfscope}
\pgfsetdash{}{0cm}
\pgfsetlinewidth{0.818mm}
\pgfsetmiterlimit{4.0}
\pgfpathmoveto{\pgfqpoint{1.383cm}{0.178cm}}
\pgfpathcurveto{\pgfqpoint{1.383cm}{0.214cm}}{\pgfqpoint{1.369cm}{0.249cm}}{\pgfqpoint{1.343cm}{0.275cm}}
\pgfpathcurveto{\pgfqpoint{1.317cm}{0.3cm}}{\pgfqpoint{1.283cm}{0.315cm}}{\pgfqpoint{1.246cm}{0.315cm}}
\pgfpathcurveto{\pgfqpoint{1.21cm}{0.315cm}}{\pgfqpoint{1.175cm}{0.3cm}}{\pgfqpoint{1.15cm}{0.275cm}}
\pgfpathcurveto{\pgfqpoint{1.124cm}{0.249cm}}{\pgfqpoint{1.11cm}{0.214cm}}{\pgfqpoint{1.11cm}{0.178cm}}
\pgfpathcurveto{\pgfqpoint{1.11cm}{0.141cm}}{\pgfqpoint{1.124cm}{0.107cm}}{\pgfqpoint{1.15cm}{0.081cm}}
\pgfpathcurveto{\pgfqpoint{1.175cm}{0.055cm}}{\pgfqpoint{1.21cm}{0.041cm}}{\pgfqpoint{1.246cm}{0.041cm}}
\pgfpathcurveto{\pgfqpoint{1.283cm}{0.041cm}}{\pgfqpoint{1.317cm}{0.055cm}}{\pgfqpoint{1.343cm}{0.081cm}}
\pgfpathcurveto{\pgfqpoint{1.369cm}{0.107cm}}{\pgfqpoint{1.383cm}{0.141cm}}{\pgfqpoint{1.383cm}{0.178cm}}
\pgfusepath{stroke}
\end{pgfscope}
\end{pgfscope}
\end{pgfscope}
\end{pgfscope}
\end{tikzpicture}}}}-\rmm{3( \phi + \psi)\prec\UU_>X^{\!\resizebox{!}{.8em}{
\begin{tikzpicture}
\pgfpathmoveto{\pgfqpoint{0cm}{-0.035cm}}
\pgfpathlineto{\pgfqpoint{1.976cm}{-0.035cm}}
\pgfpathlineto{\pgfqpoint{1.976cm}{1.94cm}}
\pgfpathlineto{\pgfqpoint{0cm}{1.94cm}}
\pgfpathclose
\pgfusepath{clip}
\begin{pgfscope}
\begin{pgfscope}
\pgfpathmoveto{\pgfqpoint{0cm}{-0.035cm}}
\pgfpathlineto{\pgfqpoint{1.976cm}{-0.035cm}}
\pgfpathlineto{\pgfqpoint{1.976cm}{1.94cm}}
\pgfpathlineto{\pgfqpoint{0cm}{1.94cm}}
\pgfpathclose
\pgfusepath{clip}
\begin{pgfscope}
\begin{pgfscope}
\pgfsetdash{}{0cm}
\pgfsetlinewidth{0.818mm}
\pgfsetroundcap
\pgfsetroundjoin
\pgfsetmiterlimit{7.0}
\definecolor{eps2pgf_color}{gray}{0}\pgfsetstrokecolor{eps2pgf_color}\pgfsetfillcolor{eps2pgf_color}
\pgfpathmoveto{\pgfqpoint{0.117cm}{1.815cm}}
\pgfpathlineto{\pgfqpoint{0.682cm}{1.065cm}}
\pgfpathlineto{\pgfqpoint{1.246cm}{1.815cm}}
\pgfusepath{stroke}
\end{pgfscope}
\definecolor{eps2pgf_color}{gray}{0}\pgfsetstrokecolor{eps2pgf_color}\pgfsetfillcolor{eps2pgf_color}
\pgfpathmoveto{\pgfqpoint{0.273cm}{1.789cm}}
\pgfpathcurveto{\pgfqpoint{0.273cm}{1.825cm}}{\pgfqpoint{0.259cm}{1.86cm}}{\pgfqpoint{0.233cm}{1.886cm}}
\pgfpathcurveto{\pgfqpoint{0.207cm}{1.912cm}}{\pgfqpoint{0.173cm}{1.926cm}}{\pgfqpoint{0.137cm}{1.926cm}}
\pgfpathcurveto{\pgfqpoint{0.1cm}{1.926cm}}{\pgfqpoint{0.066cm}{1.912cm}}{\pgfqpoint{0.04cm}{1.886cm}}
\pgfpathcurveto{\pgfqpoint{0.014cm}{1.86cm}}{\pgfqpoint{0cm}{1.825cm}}{\pgfqpoint{0cm}{1.789cm}}
\pgfpathcurveto{\pgfqpoint{0cm}{1.753cm}}{\pgfqpoint{0.014cm}{1.718cm}}{\pgfqpoint{0.04cm}{1.692cm}}
\pgfpathcurveto{\pgfqpoint{0.066cm}{1.667cm}}{\pgfqpoint{0.1cm}{1.652cm}}{\pgfqpoint{0.137cm}{1.652cm}}
\pgfpathcurveto{\pgfqpoint{0.173cm}{1.652cm}}{\pgfqpoint{0.207cm}{1.667cm}}{\pgfqpoint{0.233cm}{1.692cm}}
\pgfpathcurveto{\pgfqpoint{0.259cm}{1.718cm}}{\pgfqpoint{0.273cm}{1.753cm}}{\pgfqpoint{0.273cm}{1.789cm}}
\pgfusepath{fill}
\pgfpathmoveto{\pgfqpoint{1.345cm}{1.765cm}}
\pgfpathcurveto{\pgfqpoint{1.345cm}{1.801cm}}{\pgfqpoint{1.331cm}{1.836cm}}{\pgfqpoint{1.305cm}{1.862cm}}
\pgfpathcurveto{\pgfqpoint{1.28cm}{1.887cm}}{\pgfqpoint{1.245cm}{1.902cm}}{\pgfqpoint{1.209cm}{1.902cm}}
\pgfpathcurveto{\pgfqpoint{1.172cm}{1.902cm}}{\pgfqpoint{1.138cm}{1.887cm}}{\pgfqpoint{1.112cm}{1.862cm}}
\pgfpathcurveto{\pgfqpoint{1.087cm}{1.836cm}}{\pgfqpoint{1.072cm}{1.801cm}}{\pgfqpoint{1.072cm}{1.765cm}}
\pgfpathcurveto{\pgfqpoint{1.072cm}{1.728cm}}{\pgfqpoint{1.087cm}{1.694cm}}{\pgfqpoint{1.112cm}{1.668cm}}
\pgfpathcurveto{\pgfqpoint{1.138cm}{1.642cm}}{\pgfqpoint{1.172cm}{1.628cm}}{\pgfqpoint{1.209cm}{1.628cm}}
\pgfpathcurveto{\pgfqpoint{1.245cm}{1.628cm}}{\pgfqpoint{1.28cm}{1.642cm}}{\pgfqpoint{1.305cm}{1.668cm}}
\pgfpathcurveto{\pgfqpoint{1.331cm}{1.694cm}}{\pgfqpoint{1.345cm}{1.728cm}}{\pgfqpoint{1.345cm}{1.765cm}}
\pgfusepath{fill}
\begin{pgfscope}
\pgfsetdash{}{0cm}
\pgfsetlinewidth{0.818mm}
\pgfsetroundcap
\pgfsetroundjoin
\pgfsetmiterlimit{7.0}
\pgfpathmoveto{\pgfqpoint{0.682cm}{1.065cm}}
\pgfpathlineto{\pgfqpoint{1.246cm}{0.315cm}}
\pgfpathlineto{\pgfqpoint{1.811cm}{1.065cm}}
\pgfusepath{stroke}
\end{pgfscope}
\pgfpathmoveto{\pgfqpoint{1.948cm}{1.065cm}}
\pgfpathcurveto{\pgfqpoint{1.948cm}{1.101cm}}{\pgfqpoint{1.933cm}{1.136cm}}{\pgfqpoint{1.907cm}{1.162cm}}
\pgfpathcurveto{\pgfqpoint{1.882cm}{1.187cm}}{\pgfqpoint{1.847cm}{1.202cm}}{\pgfqpoint{1.811cm}{1.202cm}}
\pgfpathcurveto{\pgfqpoint{1.775cm}{1.202cm}}{\pgfqpoint{1.74cm}{1.187cm}}{\pgfqpoint{1.714cm}{1.162cm}}
\pgfpathcurveto{\pgfqpoint{1.689cm}{1.136cm}}{\pgfqpoint{1.674cm}{1.101cm}}{\pgfqpoint{1.674cm}{1.065cm}}
\pgfpathcurveto{\pgfqpoint{1.674cm}{1.029cm}}{\pgfqpoint{1.689cm}{0.994cm}}{\pgfqpoint{1.714cm}{0.968cm}}
\pgfpathcurveto{\pgfqpoint{1.74cm}{0.942cm}}{\pgfqpoint{1.775cm}{0.928cm}}{\pgfqpoint{1.811cm}{0.928cm}}
\pgfpathcurveto{\pgfqpoint{1.847cm}{0.928cm}}{\pgfqpoint{1.882cm}{0.942cm}}{\pgfqpoint{1.907cm}{0.968cm}}
\pgfpathcurveto{\pgfqpoint{1.933cm}{0.994cm}}{\pgfqpoint{1.948cm}{1.029cm}}{\pgfqpoint{1.948cm}{1.065cm}}
\pgfusepath{fill}
\begin{pgfscope}
\pgfsetdash{}{0cm}
\pgfsetlinewidth{0.818mm}
\pgfsetmiterlimit{7.0}
\pgfpathmoveto{\pgfqpoint{1.246cm}{0.315cm}}
\pgfpathlineto{\pgfqpoint{1.244cm}{1.061cm}}
\pgfusepath{stroke}
\end{pgfscope}
\pgfpathmoveto{\pgfqpoint{1.38cm}{1.065cm}}
\pgfpathcurveto{\pgfqpoint{1.38cm}{1.101cm}}{\pgfqpoint{1.366cm}{1.136cm}}{\pgfqpoint{1.34cm}{1.162cm}}
\pgfpathcurveto{\pgfqpoint{1.315cm}{1.187cm}}{\pgfqpoint{1.28cm}{1.202cm}}{\pgfqpoint{1.244cm}{1.202cm}}
\pgfpathcurveto{\pgfqpoint{1.207cm}{1.202cm}}{\pgfqpoint{1.173cm}{1.187cm}}{\pgfqpoint{1.147cm}{1.162cm}}
\pgfpathcurveto{\pgfqpoint{1.121cm}{1.136cm}}{\pgfqpoint{1.107cm}{1.101cm}}{\pgfqpoint{1.107cm}{1.065cm}}
\pgfpathcurveto{\pgfqpoint{1.107cm}{1.029cm}}{\pgfqpoint{1.121cm}{0.994cm}}{\pgfqpoint{1.147cm}{0.968cm}}
\pgfpathcurveto{\pgfqpoint{1.173cm}{0.942cm}}{\pgfqpoint{1.207cm}{0.928cm}}{\pgfqpoint{1.244cm}{0.928cm}}
\pgfpathcurveto{\pgfqpoint{1.28cm}{0.928cm}}{\pgfqpoint{1.315cm}{0.942cm}}{\pgfqpoint{1.34cm}{0.968cm}}
\pgfpathcurveto{\pgfqpoint{1.366cm}{0.994cm}}{\pgfqpoint{1.38cm}{1.029cm}}{\pgfqpoint{1.38cm}{1.065cm}}
\pgfusepath{fill}
\begin{pgfscope}
\pgfsetdash{}{0cm}
\pgfsetlinewidth{0.818mm}
\pgfsetmiterlimit{4.0}
\pgfpathmoveto{\pgfqpoint{1.383cm}{0.178cm}}
\pgfpathcurveto{\pgfqpoint{1.383cm}{0.214cm}}{\pgfqpoint{1.369cm}{0.249cm}}{\pgfqpoint{1.343cm}{0.275cm}}
\pgfpathcurveto{\pgfqpoint{1.317cm}{0.3cm}}{\pgfqpoint{1.283cm}{0.315cm}}{\pgfqpoint{1.246cm}{0.315cm}}
\pgfpathcurveto{\pgfqpoint{1.21cm}{0.315cm}}{\pgfqpoint{1.175cm}{0.3cm}}{\pgfqpoint{1.15cm}{0.275cm}}
\pgfpathcurveto{\pgfqpoint{1.124cm}{0.249cm}}{\pgfqpoint{1.11cm}{0.214cm}}{\pgfqpoint{1.11cm}{0.178cm}}
\pgfpathcurveto{\pgfqpoint{1.11cm}{0.141cm}}{\pgfqpoint{1.124cm}{0.107cm}}{\pgfqpoint{1.15cm}{0.081cm}}
\pgfpathcurveto{\pgfqpoint{1.175cm}{0.055cm}}{\pgfqpoint{1.21cm}{0.041cm}}{\pgfqpoint{1.246cm}{0.041cm}}
\pgfpathcurveto{\pgfqpoint{1.283cm}{0.041cm}}{\pgfqpoint{1.317cm}{0.055cm}}{\pgfqpoint{1.343cm}{0.081cm}}
\pgfpathcurveto{\pgfqpoint{1.369cm}{0.107cm}}{\pgfqpoint{1.383cm}{0.141cm}}{\pgfqpoint{1.383cm}{0.178cm}}
\pgfusepath{stroke}
\end{pgfscope}
\end{pgfscope}
\end{pgfscope}
\end{pgfscope}
\end{tikzpicture}}}}\\
&\quad-\rmb{3( \phi + \psi)\prec\UU_\leqslantX^{\!\resizebox{!}{.8em}{
\begin{tikzpicture}
\pgfpathmoveto{\pgfqpoint{0cm}{-0.035cm}}
\pgfpathlineto{\pgfqpoint{1.976cm}{-0.035cm}}
\pgfpathlineto{\pgfqpoint{1.976cm}{1.94cm}}
\pgfpathlineto{\pgfqpoint{0cm}{1.94cm}}
\pgfpathclose
\pgfusepath{clip}
\begin{pgfscope}
\begin{pgfscope}
\pgfpathmoveto{\pgfqpoint{0cm}{-0.035cm}}
\pgfpathlineto{\pgfqpoint{1.976cm}{-0.035cm}}
\pgfpathlineto{\pgfqpoint{1.976cm}{1.94cm}}
\pgfpathlineto{\pgfqpoint{0cm}{1.94cm}}
\pgfpathclose
\pgfusepath{clip}
\begin{pgfscope}
\begin{pgfscope}
\pgfsetdash{}{0cm}
\pgfsetlinewidth{0.818mm}
\pgfsetroundcap
\pgfsetroundjoin
\pgfsetmiterlimit{7.0}
\definecolor{eps2pgf_color}{gray}{0}\pgfsetstrokecolor{eps2pgf_color}\pgfsetfillcolor{eps2pgf_color}
\pgfpathmoveto{\pgfqpoint{0.117cm}{1.815cm}}
\pgfpathlineto{\pgfqpoint{0.682cm}{1.065cm}}
\pgfpathlineto{\pgfqpoint{1.246cm}{1.815cm}}
\pgfusepath{stroke}
\end{pgfscope}
\definecolor{eps2pgf_color}{gray}{0}\pgfsetstrokecolor{eps2pgf_color}\pgfsetfillcolor{eps2pgf_color}
\pgfpathmoveto{\pgfqpoint{0.273cm}{1.789cm}}
\pgfpathcurveto{\pgfqpoint{0.273cm}{1.825cm}}{\pgfqpoint{0.259cm}{1.86cm}}{\pgfqpoint{0.233cm}{1.886cm}}
\pgfpathcurveto{\pgfqpoint{0.207cm}{1.912cm}}{\pgfqpoint{0.173cm}{1.926cm}}{\pgfqpoint{0.137cm}{1.926cm}}
\pgfpathcurveto{\pgfqpoint{0.1cm}{1.926cm}}{\pgfqpoint{0.066cm}{1.912cm}}{\pgfqpoint{0.04cm}{1.886cm}}
\pgfpathcurveto{\pgfqpoint{0.014cm}{1.86cm}}{\pgfqpoint{0cm}{1.825cm}}{\pgfqpoint{0cm}{1.789cm}}
\pgfpathcurveto{\pgfqpoint{0cm}{1.753cm}}{\pgfqpoint{0.014cm}{1.718cm}}{\pgfqpoint{0.04cm}{1.692cm}}
\pgfpathcurveto{\pgfqpoint{0.066cm}{1.667cm}}{\pgfqpoint{0.1cm}{1.652cm}}{\pgfqpoint{0.137cm}{1.652cm}}
\pgfpathcurveto{\pgfqpoint{0.173cm}{1.652cm}}{\pgfqpoint{0.207cm}{1.667cm}}{\pgfqpoint{0.233cm}{1.692cm}}
\pgfpathcurveto{\pgfqpoint{0.259cm}{1.718cm}}{\pgfqpoint{0.273cm}{1.753cm}}{\pgfqpoint{0.273cm}{1.789cm}}
\pgfusepath{fill}
\pgfpathmoveto{\pgfqpoint{1.345cm}{1.765cm}}
\pgfpathcurveto{\pgfqpoint{1.345cm}{1.801cm}}{\pgfqpoint{1.331cm}{1.836cm}}{\pgfqpoint{1.305cm}{1.862cm}}
\pgfpathcurveto{\pgfqpoint{1.28cm}{1.887cm}}{\pgfqpoint{1.245cm}{1.902cm}}{\pgfqpoint{1.209cm}{1.902cm}}
\pgfpathcurveto{\pgfqpoint{1.172cm}{1.902cm}}{\pgfqpoint{1.138cm}{1.887cm}}{\pgfqpoint{1.112cm}{1.862cm}}
\pgfpathcurveto{\pgfqpoint{1.087cm}{1.836cm}}{\pgfqpoint{1.072cm}{1.801cm}}{\pgfqpoint{1.072cm}{1.765cm}}
\pgfpathcurveto{\pgfqpoint{1.072cm}{1.728cm}}{\pgfqpoint{1.087cm}{1.694cm}}{\pgfqpoint{1.112cm}{1.668cm}}
\pgfpathcurveto{\pgfqpoint{1.138cm}{1.642cm}}{\pgfqpoint{1.172cm}{1.628cm}}{\pgfqpoint{1.209cm}{1.628cm}}
\pgfpathcurveto{\pgfqpoint{1.245cm}{1.628cm}}{\pgfqpoint{1.28cm}{1.642cm}}{\pgfqpoint{1.305cm}{1.668cm}}
\pgfpathcurveto{\pgfqpoint{1.331cm}{1.694cm}}{\pgfqpoint{1.345cm}{1.728cm}}{\pgfqpoint{1.345cm}{1.765cm}}
\pgfusepath{fill}
\begin{pgfscope}
\pgfsetdash{}{0cm}
\pgfsetlinewidth{0.818mm}
\pgfsetroundcap
\pgfsetroundjoin
\pgfsetmiterlimit{7.0}
\pgfpathmoveto{\pgfqpoint{0.682cm}{1.065cm}}
\pgfpathlineto{\pgfqpoint{1.246cm}{0.315cm}}
\pgfpathlineto{\pgfqpoint{1.811cm}{1.065cm}}
\pgfusepath{stroke}
\end{pgfscope}
\pgfpathmoveto{\pgfqpoint{1.948cm}{1.065cm}}
\pgfpathcurveto{\pgfqpoint{1.948cm}{1.101cm}}{\pgfqpoint{1.933cm}{1.136cm}}{\pgfqpoint{1.907cm}{1.162cm}}
\pgfpathcurveto{\pgfqpoint{1.882cm}{1.187cm}}{\pgfqpoint{1.847cm}{1.202cm}}{\pgfqpoint{1.811cm}{1.202cm}}
\pgfpathcurveto{\pgfqpoint{1.775cm}{1.202cm}}{\pgfqpoint{1.74cm}{1.187cm}}{\pgfqpoint{1.714cm}{1.162cm}}
\pgfpathcurveto{\pgfqpoint{1.689cm}{1.136cm}}{\pgfqpoint{1.674cm}{1.101cm}}{\pgfqpoint{1.674cm}{1.065cm}}
\pgfpathcurveto{\pgfqpoint{1.674cm}{1.029cm}}{\pgfqpoint{1.689cm}{0.994cm}}{\pgfqpoint{1.714cm}{0.968cm}}
\pgfpathcurveto{\pgfqpoint{1.74cm}{0.942cm}}{\pgfqpoint{1.775cm}{0.928cm}}{\pgfqpoint{1.811cm}{0.928cm}}
\pgfpathcurveto{\pgfqpoint{1.847cm}{0.928cm}}{\pgfqpoint{1.882cm}{0.942cm}}{\pgfqpoint{1.907cm}{0.968cm}}
\pgfpathcurveto{\pgfqpoint{1.933cm}{0.994cm}}{\pgfqpoint{1.948cm}{1.029cm}}{\pgfqpoint{1.948cm}{1.065cm}}
\pgfusepath{fill}
\begin{pgfscope}
\pgfsetdash{}{0cm}
\pgfsetlinewidth{0.818mm}
\pgfsetmiterlimit{7.0}
\pgfpathmoveto{\pgfqpoint{1.246cm}{0.315cm}}
\pgfpathlineto{\pgfqpoint{1.244cm}{1.061cm}}
\pgfusepath{stroke}
\end{pgfscope}
\pgfpathmoveto{\pgfqpoint{1.38cm}{1.065cm}}
\pgfpathcurveto{\pgfqpoint{1.38cm}{1.101cm}}{\pgfqpoint{1.366cm}{1.136cm}}{\pgfqpoint{1.34cm}{1.162cm}}
\pgfpathcurveto{\pgfqpoint{1.315cm}{1.187cm}}{\pgfqpoint{1.28cm}{1.202cm}}{\pgfqpoint{1.244cm}{1.202cm}}
\pgfpathcurveto{\pgfqpoint{1.207cm}{1.202cm}}{\pgfqpoint{1.173cm}{1.187cm}}{\pgfqpoint{1.147cm}{1.162cm}}
\pgfpathcurveto{\pgfqpoint{1.121cm}{1.136cm}}{\pgfqpoint{1.107cm}{1.101cm}}{\pgfqpoint{1.107cm}{1.065cm}}
\pgfpathcurveto{\pgfqpoint{1.107cm}{1.029cm}}{\pgfqpoint{1.121cm}{0.994cm}}{\pgfqpoint{1.147cm}{0.968cm}}
\pgfpathcurveto{\pgfqpoint{1.173cm}{0.942cm}}{\pgfqpoint{1.207cm}{0.928cm}}{\pgfqpoint{1.244cm}{0.928cm}}
\pgfpathcurveto{\pgfqpoint{1.28cm}{0.928cm}}{\pgfqpoint{1.315cm}{0.942cm}}{\pgfqpoint{1.34cm}{0.968cm}}
\pgfpathcurveto{\pgfqpoint{1.366cm}{0.994cm}}{\pgfqpoint{1.38cm}{1.029cm}}{\pgfqpoint{1.38cm}{1.065cm}}
\pgfusepath{fill}
\begin{pgfscope}
\pgfsetdash{}{0cm}
\pgfsetlinewidth{0.818mm}
\pgfsetmiterlimit{4.0}
\pgfpathmoveto{\pgfqpoint{1.383cm}{0.178cm}}
\pgfpathcurveto{\pgfqpoint{1.383cm}{0.214cm}}{\pgfqpoint{1.369cm}{0.249cm}}{\pgfqpoint{1.343cm}{0.275cm}}
\pgfpathcurveto{\pgfqpoint{1.317cm}{0.3cm}}{\pgfqpoint{1.283cm}{0.315cm}}{\pgfqpoint{1.246cm}{0.315cm}}
\pgfpathcurveto{\pgfqpoint{1.21cm}{0.315cm}}{\pgfqpoint{1.175cm}{0.3cm}}{\pgfqpoint{1.15cm}{0.275cm}}
\pgfpathcurveto{\pgfqpoint{1.124cm}{0.249cm}}{\pgfqpoint{1.11cm}{0.214cm}}{\pgfqpoint{1.11cm}{0.178cm}}
\pgfpathcurveto{\pgfqpoint{1.11cm}{0.141cm}}{\pgfqpoint{1.124cm}{0.107cm}}{\pgfqpoint{1.15cm}{0.081cm}}
\pgfpathcurveto{\pgfqpoint{1.175cm}{0.055cm}}{\pgfqpoint{1.21cm}{0.041cm}}{\pgfqpoint{1.246cm}{0.041cm}}
\pgfpathcurveto{\pgfqpoint{1.283cm}{0.041cm}}{\pgfqpoint{1.317cm}{0.055cm}}{\pgfqpoint{1.343cm}{0.081cm}}
\pgfpathcurveto{\pgfqpoint{1.369cm}{0.107cm}}{\pgfqpoint{1.383cm}{0.141cm}}{\pgfqpoint{1.383cm}{0.178cm}}
\pgfusepath{stroke}
\end{pgfscope}
\end{pgfscope}
\end{pgfscope}
\end{pgfscope}
\end{tikzpicture}}}}-\rmb{3( \phi + \psi)\succcurlyeqX^{\!\resizebox{!}{.8em}{
\begin{tikzpicture}
\pgfpathmoveto{\pgfqpoint{0cm}{-0.035cm}}
\pgfpathlineto{\pgfqpoint{1.976cm}{-0.035cm}}
\pgfpathlineto{\pgfqpoint{1.976cm}{1.94cm}}
\pgfpathlineto{\pgfqpoint{0cm}{1.94cm}}
\pgfpathclose
\pgfusepath{clip}
\begin{pgfscope}
\begin{pgfscope}
\pgfpathmoveto{\pgfqpoint{0cm}{-0.035cm}}
\pgfpathlineto{\pgfqpoint{1.976cm}{-0.035cm}}
\pgfpathlineto{\pgfqpoint{1.976cm}{1.94cm}}
\pgfpathlineto{\pgfqpoint{0cm}{1.94cm}}
\pgfpathclose
\pgfusepath{clip}
\begin{pgfscope}
\begin{pgfscope}
\pgfsetdash{}{0cm}
\pgfsetlinewidth{0.818mm}
\pgfsetroundcap
\pgfsetroundjoin
\pgfsetmiterlimit{7.0}
\definecolor{eps2pgf_color}{gray}{0}\pgfsetstrokecolor{eps2pgf_color}\pgfsetfillcolor{eps2pgf_color}
\pgfpathmoveto{\pgfqpoint{0.117cm}{1.815cm}}
\pgfpathlineto{\pgfqpoint{0.682cm}{1.065cm}}
\pgfpathlineto{\pgfqpoint{1.246cm}{1.815cm}}
\pgfusepath{stroke}
\end{pgfscope}
\definecolor{eps2pgf_color}{gray}{0}\pgfsetstrokecolor{eps2pgf_color}\pgfsetfillcolor{eps2pgf_color}
\pgfpathmoveto{\pgfqpoint{0.273cm}{1.789cm}}
\pgfpathcurveto{\pgfqpoint{0.273cm}{1.825cm}}{\pgfqpoint{0.259cm}{1.86cm}}{\pgfqpoint{0.233cm}{1.886cm}}
\pgfpathcurveto{\pgfqpoint{0.207cm}{1.912cm}}{\pgfqpoint{0.173cm}{1.926cm}}{\pgfqpoint{0.137cm}{1.926cm}}
\pgfpathcurveto{\pgfqpoint{0.1cm}{1.926cm}}{\pgfqpoint{0.066cm}{1.912cm}}{\pgfqpoint{0.04cm}{1.886cm}}
\pgfpathcurveto{\pgfqpoint{0.014cm}{1.86cm}}{\pgfqpoint{0cm}{1.825cm}}{\pgfqpoint{0cm}{1.789cm}}
\pgfpathcurveto{\pgfqpoint{0cm}{1.753cm}}{\pgfqpoint{0.014cm}{1.718cm}}{\pgfqpoint{0.04cm}{1.692cm}}
\pgfpathcurveto{\pgfqpoint{0.066cm}{1.667cm}}{\pgfqpoint{0.1cm}{1.652cm}}{\pgfqpoint{0.137cm}{1.652cm}}
\pgfpathcurveto{\pgfqpoint{0.173cm}{1.652cm}}{\pgfqpoint{0.207cm}{1.667cm}}{\pgfqpoint{0.233cm}{1.692cm}}
\pgfpathcurveto{\pgfqpoint{0.259cm}{1.718cm}}{\pgfqpoint{0.273cm}{1.753cm}}{\pgfqpoint{0.273cm}{1.789cm}}
\pgfusepath{fill}
\pgfpathmoveto{\pgfqpoint{1.345cm}{1.765cm}}
\pgfpathcurveto{\pgfqpoint{1.345cm}{1.801cm}}{\pgfqpoint{1.331cm}{1.836cm}}{\pgfqpoint{1.305cm}{1.862cm}}
\pgfpathcurveto{\pgfqpoint{1.28cm}{1.887cm}}{\pgfqpoint{1.245cm}{1.902cm}}{\pgfqpoint{1.209cm}{1.902cm}}
\pgfpathcurveto{\pgfqpoint{1.172cm}{1.902cm}}{\pgfqpoint{1.138cm}{1.887cm}}{\pgfqpoint{1.112cm}{1.862cm}}
\pgfpathcurveto{\pgfqpoint{1.087cm}{1.836cm}}{\pgfqpoint{1.072cm}{1.801cm}}{\pgfqpoint{1.072cm}{1.765cm}}
\pgfpathcurveto{\pgfqpoint{1.072cm}{1.728cm}}{\pgfqpoint{1.087cm}{1.694cm}}{\pgfqpoint{1.112cm}{1.668cm}}
\pgfpathcurveto{\pgfqpoint{1.138cm}{1.642cm}}{\pgfqpoint{1.172cm}{1.628cm}}{\pgfqpoint{1.209cm}{1.628cm}}
\pgfpathcurveto{\pgfqpoint{1.245cm}{1.628cm}}{\pgfqpoint{1.28cm}{1.642cm}}{\pgfqpoint{1.305cm}{1.668cm}}
\pgfpathcurveto{\pgfqpoint{1.331cm}{1.694cm}}{\pgfqpoint{1.345cm}{1.728cm}}{\pgfqpoint{1.345cm}{1.765cm}}
\pgfusepath{fill}
\begin{pgfscope}
\pgfsetdash{}{0cm}
\pgfsetlinewidth{0.818mm}
\pgfsetroundcap
\pgfsetroundjoin
\pgfsetmiterlimit{7.0}
\pgfpathmoveto{\pgfqpoint{0.682cm}{1.065cm}}
\pgfpathlineto{\pgfqpoint{1.246cm}{0.315cm}}
\pgfpathlineto{\pgfqpoint{1.811cm}{1.065cm}}
\pgfusepath{stroke}
\end{pgfscope}
\pgfpathmoveto{\pgfqpoint{1.948cm}{1.065cm}}
\pgfpathcurveto{\pgfqpoint{1.948cm}{1.101cm}}{\pgfqpoint{1.933cm}{1.136cm}}{\pgfqpoint{1.907cm}{1.162cm}}
\pgfpathcurveto{\pgfqpoint{1.882cm}{1.187cm}}{\pgfqpoint{1.847cm}{1.202cm}}{\pgfqpoint{1.811cm}{1.202cm}}
\pgfpathcurveto{\pgfqpoint{1.775cm}{1.202cm}}{\pgfqpoint{1.74cm}{1.187cm}}{\pgfqpoint{1.714cm}{1.162cm}}
\pgfpathcurveto{\pgfqpoint{1.689cm}{1.136cm}}{\pgfqpoint{1.674cm}{1.101cm}}{\pgfqpoint{1.674cm}{1.065cm}}
\pgfpathcurveto{\pgfqpoint{1.674cm}{1.029cm}}{\pgfqpoint{1.689cm}{0.994cm}}{\pgfqpoint{1.714cm}{0.968cm}}
\pgfpathcurveto{\pgfqpoint{1.74cm}{0.942cm}}{\pgfqpoint{1.775cm}{0.928cm}}{\pgfqpoint{1.811cm}{0.928cm}}
\pgfpathcurveto{\pgfqpoint{1.847cm}{0.928cm}}{\pgfqpoint{1.882cm}{0.942cm}}{\pgfqpoint{1.907cm}{0.968cm}}
\pgfpathcurveto{\pgfqpoint{1.933cm}{0.994cm}}{\pgfqpoint{1.948cm}{1.029cm}}{\pgfqpoint{1.948cm}{1.065cm}}
\pgfusepath{fill}
\begin{pgfscope}
\pgfsetdash{}{0cm}
\pgfsetlinewidth{0.818mm}
\pgfsetmiterlimit{7.0}
\pgfpathmoveto{\pgfqpoint{1.246cm}{0.315cm}}
\pgfpathlineto{\pgfqpoint{1.244cm}{1.061cm}}
\pgfusepath{stroke}
\end{pgfscope}
\pgfpathmoveto{\pgfqpoint{1.38cm}{1.065cm}}
\pgfpathcurveto{\pgfqpoint{1.38cm}{1.101cm}}{\pgfqpoint{1.366cm}{1.136cm}}{\pgfqpoint{1.34cm}{1.162cm}}
\pgfpathcurveto{\pgfqpoint{1.315cm}{1.187cm}}{\pgfqpoint{1.28cm}{1.202cm}}{\pgfqpoint{1.244cm}{1.202cm}}
\pgfpathcurveto{\pgfqpoint{1.207cm}{1.202cm}}{\pgfqpoint{1.173cm}{1.187cm}}{\pgfqpoint{1.147cm}{1.162cm}}
\pgfpathcurveto{\pgfqpoint{1.121cm}{1.136cm}}{\pgfqpoint{1.107cm}{1.101cm}}{\pgfqpoint{1.107cm}{1.065cm}}
\pgfpathcurveto{\pgfqpoint{1.107cm}{1.029cm}}{\pgfqpoint{1.121cm}{0.994cm}}{\pgfqpoint{1.147cm}{0.968cm}}
\pgfpathcurveto{\pgfqpoint{1.173cm}{0.942cm}}{\pgfqpoint{1.207cm}{0.928cm}}{\pgfqpoint{1.244cm}{0.928cm}}
\pgfpathcurveto{\pgfqpoint{1.28cm}{0.928cm}}{\pgfqpoint{1.315cm}{0.942cm}}{\pgfqpoint{1.34cm}{0.968cm}}
\pgfpathcurveto{\pgfqpoint{1.366cm}{0.994cm}}{\pgfqpoint{1.38cm}{1.029cm}}{\pgfqpoint{1.38cm}{1.065cm}}
\pgfusepath{fill}
\begin{pgfscope}
\pgfsetdash{}{0cm}
\pgfsetlinewidth{0.818mm}
\pgfsetmiterlimit{4.0}
\pgfpathmoveto{\pgfqpoint{1.383cm}{0.178cm}}
\pgfpathcurveto{\pgfqpoint{1.383cm}{0.214cm}}{\pgfqpoint{1.369cm}{0.249cm}}{\pgfqpoint{1.343cm}{0.275cm}}
\pgfpathcurveto{\pgfqpoint{1.317cm}{0.3cm}}{\pgfqpoint{1.283cm}{0.315cm}}{\pgfqpoint{1.246cm}{0.315cm}}
\pgfpathcurveto{\pgfqpoint{1.21cm}{0.315cm}}{\pgfqpoint{1.175cm}{0.3cm}}{\pgfqpoint{1.15cm}{0.275cm}}
\pgfpathcurveto{\pgfqpoint{1.124cm}{0.249cm}}{\pgfqpoint{1.11cm}{0.214cm}}{\pgfqpoint{1.11cm}{0.178cm}}
\pgfpathcurveto{\pgfqpoint{1.11cm}{0.141cm}}{\pgfqpoint{1.124cm}{0.107cm}}{\pgfqpoint{1.15cm}{0.081cm}}
\pgfpathcurveto{\pgfqpoint{1.175cm}{0.055cm}}{\pgfqpoint{1.21cm}{0.041cm}}{\pgfqpoint{1.246cm}{0.041cm}}
\pgfpathcurveto{\pgfqpoint{1.283cm}{0.041cm}}{\pgfqpoint{1.317cm}{0.055cm}}{\pgfqpoint{1.343cm}{0.081cm}}
\pgfpathcurveto{\pgfqpoint{1.369cm}{0.107cm}}{\pgfqpoint{1.383cm}{0.141cm}}{\pgfqpoint{1.383cm}{0.178cm}}
\pgfusepath{stroke}
\end{pgfscope}
\end{pgfscope}
\end{pgfscope}
\end{pgfscope}
\end{tikzpicture}}}},
\end{align*}
\begin{align}\label{eq:92}
\rmg{6X\succ(X^{\!\resizebox{0.6em}{!}{
\begin{tikzpicture}
\pgfpathmoveto{\pgfqpoint{0cm}{-0.035cm}}
\pgfpathlineto{\pgfqpoint{1.376cm}{-0.035cm}}
\pgfpathlineto{\pgfqpoint{1.376cm}{1.552cm}}
\pgfpathlineto{\pgfqpoint{0cm}{1.552cm}}
\pgfpathclose
\pgfusepath{clip}
\begin{pgfscope}
\begin{pgfscope}
\pgfpathmoveto{\pgfqpoint{0cm}{-0.035cm}}
\pgfpathlineto{\pgfqpoint{1.376cm}{-0.035cm}}
\pgfpathlineto{\pgfqpoint{1.376cm}{1.552cm}}
\pgfpathlineto{\pgfqpoint{0cm}{1.552cm}}
\pgfpathclose
\pgfusepath{clip}
\begin{pgfscope}
\begin{pgfscope}
\pgfsetdash{}{0cm}
\pgfsetlinewidth{0.818mm}
\pgfsetroundcap
\pgfsetroundjoin
\pgfsetmiterlimit{7.0}
\definecolor{eps2pgf_color}{gray}{0}\pgfsetstrokecolor{eps2pgf_color}\pgfsetfillcolor{eps2pgf_color}
\pgfpathmoveto{\pgfqpoint{0.117cm}{1.421cm}}
\pgfpathlineto{\pgfqpoint{0.682cm}{0.671cm}}
\pgfpathlineto{\pgfqpoint{1.246cm}{1.421cm}}
\pgfusepath{stroke}
\end{pgfscope}
\definecolor{eps2pgf_color}{gray}{0}\pgfsetstrokecolor{eps2pgf_color}\pgfsetfillcolor{eps2pgf_color}
\pgfpathmoveto{\pgfqpoint{0.273cm}{1.395cm}}
\pgfpathcurveto{\pgfqpoint{0.273cm}{1.432cm}}{\pgfqpoint{0.259cm}{1.467cm}}{\pgfqpoint{0.233cm}{1.492cm}}
\pgfpathcurveto{\pgfqpoint{0.207cm}{1.518cm}}{\pgfqpoint{0.173cm}{1.532cm}}{\pgfqpoint{0.137cm}{1.532cm}}
\pgfpathcurveto{\pgfqpoint{0.1cm}{1.532cm}}{\pgfqpoint{0.066cm}{1.518cm}}{\pgfqpoint{0.04cm}{1.492cm}}
\pgfpathcurveto{\pgfqpoint{0.014cm}{1.467cm}}{\pgfqpoint{0cm}{1.432cm}}{\pgfqpoint{0cm}{1.395cm}}
\pgfpathcurveto{\pgfqpoint{0cm}{1.359cm}}{\pgfqpoint{0.014cm}{1.324cm}}{\pgfqpoint{0.04cm}{1.299cm}}
\pgfpathcurveto{\pgfqpoint{0.066cm}{1.273cm}}{\pgfqpoint{0.1cm}{1.258cm}}{\pgfqpoint{0.137cm}{1.258cm}}
\pgfpathcurveto{\pgfqpoint{0.173cm}{1.258cm}}{\pgfqpoint{0.207cm}{1.273cm}}{\pgfqpoint{0.233cm}{1.299cm}}
\pgfpathcurveto{\pgfqpoint{0.259cm}{1.324cm}}{\pgfqpoint{0.273cm}{1.359cm}}{\pgfqpoint{0.273cm}{1.395cm}}
\pgfusepath{fill}
\begin{pgfscope}
\pgfsetdash{}{0cm}
\pgfsetlinewidth{0.818mm}
\pgfsetmiterlimit{7.0}
\pgfpathmoveto{\pgfqpoint{0.682cm}{0.671cm}}
\pgfpathlineto{\pgfqpoint{0.679cm}{1.418cm}}
\pgfusepath{stroke}
\end{pgfscope}
\pgfpathmoveto{\pgfqpoint{0.815cm}{1.399cm}}
\pgfpathcurveto{\pgfqpoint{0.815cm}{1.435cm}}{\pgfqpoint{0.801cm}{1.47cm}}{\pgfqpoint{0.775cm}{1.496cm}}
\pgfpathcurveto{\pgfqpoint{0.75cm}{1.521cm}}{\pgfqpoint{0.715cm}{1.536cm}}{\pgfqpoint{0.679cm}{1.536cm}}
\pgfpathcurveto{\pgfqpoint{0.643cm}{1.536cm}}{\pgfqpoint{0.608cm}{1.521cm}}{\pgfqpoint{0.582cm}{1.496cm}}
\pgfpathcurveto{\pgfqpoint{0.557cm}{1.47cm}}{\pgfqpoint{0.542cm}{1.435cm}}{\pgfqpoint{0.542cm}{1.399cm}}
\pgfpathcurveto{\pgfqpoint{0.542cm}{1.363cm}}{\pgfqpoint{0.557cm}{1.328cm}}{\pgfqpoint{0.582cm}{1.302cm}}
\pgfpathcurveto{\pgfqpoint{0.608cm}{1.276cm}}{\pgfqpoint{0.643cm}{1.262cm}}{\pgfqpoint{0.679cm}{1.262cm}}
\pgfpathcurveto{\pgfqpoint{0.715cm}{1.262cm}}{\pgfqpoint{0.75cm}{1.276cm}}{\pgfqpoint{0.775cm}{1.302cm}}
\pgfpathcurveto{\pgfqpoint{0.801cm}{1.328cm}}{\pgfqpoint{0.815cm}{1.363cm}}{\pgfqpoint{0.815cm}{1.399cm}}
\pgfusepath{fill}
\pgfpathmoveto{\pgfqpoint{1.345cm}{1.371cm}}
\pgfpathcurveto{\pgfqpoint{1.345cm}{1.408cm}}{\pgfqpoint{1.331cm}{1.442cm}}{\pgfqpoint{1.305cm}{1.468cm}}
\pgfpathcurveto{\pgfqpoint{1.28cm}{1.494cm}}{\pgfqpoint{1.245cm}{1.508cm}}{\pgfqpoint{1.209cm}{1.508cm}}
\pgfpathcurveto{\pgfqpoint{1.172cm}{1.508cm}}{\pgfqpoint{1.138cm}{1.494cm}}{\pgfqpoint{1.112cm}{1.468cm}}
\pgfpathcurveto{\pgfqpoint{1.087cm}{1.442cm}}{\pgfqpoint{1.072cm}{1.408cm}}{\pgfqpoint{1.072cm}{1.371cm}}
\pgfpathcurveto{\pgfqpoint{1.072cm}{1.335cm}}{\pgfqpoint{1.087cm}{1.3cm}}{\pgfqpoint{1.112cm}{1.274cm}}
\pgfpathcurveto{\pgfqpoint{1.138cm}{1.249cm}}{\pgfqpoint{1.172cm}{1.234cm}}{\pgfqpoint{1.209cm}{1.234cm}}
\pgfpathcurveto{\pgfqpoint{1.245cm}{1.234cm}}{\pgfqpoint{1.28cm}{1.249cm}}{\pgfqpoint{1.305cm}{1.274cm}}
\pgfpathcurveto{\pgfqpoint{1.331cm}{1.3cm}}{\pgfqpoint{1.345cm}{1.335cm}}{\pgfqpoint{1.345cm}{1.371cm}}
\pgfusepath{fill}
\begin{pgfscope}
\pgfsetdash{}{0cm}
\pgfsetlinewidth{0.818mm}
\pgfsetroundcap
\pgfsetmiterlimit{4.0}
\pgfpathmoveto{\pgfqpoint{0.682cm}{0.671cm}}
\pgfpathlineto{\pgfqpoint{0.682cm}{0.042cm}}
\pgfusepath{stroke}
\end{pgfscope}
\end{pgfscope}
\end{pgfscope}
\end{pgfscope}
\end{tikzpicture}}}(\phi+\psi))}=\rmm{6\UU_>X\succ(X^{\!\resizebox{0.6em}{!}{
\begin{tikzpicture}
\pgfpathmoveto{\pgfqpoint{0cm}{-0.035cm}}
\pgfpathlineto{\pgfqpoint{1.376cm}{-0.035cm}}
\pgfpathlineto{\pgfqpoint{1.376cm}{1.552cm}}
\pgfpathlineto{\pgfqpoint{0cm}{1.552cm}}
\pgfpathclose
\pgfusepath{clip}
\begin{pgfscope}
\begin{pgfscope}
\pgfpathmoveto{\pgfqpoint{0cm}{-0.035cm}}
\pgfpathlineto{\pgfqpoint{1.376cm}{-0.035cm}}
\pgfpathlineto{\pgfqpoint{1.376cm}{1.552cm}}
\pgfpathlineto{\pgfqpoint{0cm}{1.552cm}}
\pgfpathclose
\pgfusepath{clip}
\begin{pgfscope}
\begin{pgfscope}
\pgfsetdash{}{0cm}
\pgfsetlinewidth{0.818mm}
\pgfsetroundcap
\pgfsetroundjoin
\pgfsetmiterlimit{7.0}
\definecolor{eps2pgf_color}{gray}{0}\pgfsetstrokecolor{eps2pgf_color}\pgfsetfillcolor{eps2pgf_color}
\pgfpathmoveto{\pgfqpoint{0.117cm}{1.421cm}}
\pgfpathlineto{\pgfqpoint{0.682cm}{0.671cm}}
\pgfpathlineto{\pgfqpoint{1.246cm}{1.421cm}}
\pgfusepath{stroke}
\end{pgfscope}
\definecolor{eps2pgf_color}{gray}{0}\pgfsetstrokecolor{eps2pgf_color}\pgfsetfillcolor{eps2pgf_color}
\pgfpathmoveto{\pgfqpoint{0.273cm}{1.395cm}}
\pgfpathcurveto{\pgfqpoint{0.273cm}{1.432cm}}{\pgfqpoint{0.259cm}{1.467cm}}{\pgfqpoint{0.233cm}{1.492cm}}
\pgfpathcurveto{\pgfqpoint{0.207cm}{1.518cm}}{\pgfqpoint{0.173cm}{1.532cm}}{\pgfqpoint{0.137cm}{1.532cm}}
\pgfpathcurveto{\pgfqpoint{0.1cm}{1.532cm}}{\pgfqpoint{0.066cm}{1.518cm}}{\pgfqpoint{0.04cm}{1.492cm}}
\pgfpathcurveto{\pgfqpoint{0.014cm}{1.467cm}}{\pgfqpoint{0cm}{1.432cm}}{\pgfqpoint{0cm}{1.395cm}}
\pgfpathcurveto{\pgfqpoint{0cm}{1.359cm}}{\pgfqpoint{0.014cm}{1.324cm}}{\pgfqpoint{0.04cm}{1.299cm}}
\pgfpathcurveto{\pgfqpoint{0.066cm}{1.273cm}}{\pgfqpoint{0.1cm}{1.258cm}}{\pgfqpoint{0.137cm}{1.258cm}}
\pgfpathcurveto{\pgfqpoint{0.173cm}{1.258cm}}{\pgfqpoint{0.207cm}{1.273cm}}{\pgfqpoint{0.233cm}{1.299cm}}
\pgfpathcurveto{\pgfqpoint{0.259cm}{1.324cm}}{\pgfqpoint{0.273cm}{1.359cm}}{\pgfqpoint{0.273cm}{1.395cm}}
\pgfusepath{fill}
\begin{pgfscope}
\pgfsetdash{}{0cm}
\pgfsetlinewidth{0.818mm}
\pgfsetmiterlimit{7.0}
\pgfpathmoveto{\pgfqpoint{0.682cm}{0.671cm}}
\pgfpathlineto{\pgfqpoint{0.679cm}{1.418cm}}
\pgfusepath{stroke}
\end{pgfscope}
\pgfpathmoveto{\pgfqpoint{0.815cm}{1.399cm}}
\pgfpathcurveto{\pgfqpoint{0.815cm}{1.435cm}}{\pgfqpoint{0.801cm}{1.47cm}}{\pgfqpoint{0.775cm}{1.496cm}}
\pgfpathcurveto{\pgfqpoint{0.75cm}{1.521cm}}{\pgfqpoint{0.715cm}{1.536cm}}{\pgfqpoint{0.679cm}{1.536cm}}
\pgfpathcurveto{\pgfqpoint{0.643cm}{1.536cm}}{\pgfqpoint{0.608cm}{1.521cm}}{\pgfqpoint{0.582cm}{1.496cm}}
\pgfpathcurveto{\pgfqpoint{0.557cm}{1.47cm}}{\pgfqpoint{0.542cm}{1.435cm}}{\pgfqpoint{0.542cm}{1.399cm}}
\pgfpathcurveto{\pgfqpoint{0.542cm}{1.363cm}}{\pgfqpoint{0.557cm}{1.328cm}}{\pgfqpoint{0.582cm}{1.302cm}}
\pgfpathcurveto{\pgfqpoint{0.608cm}{1.276cm}}{\pgfqpoint{0.643cm}{1.262cm}}{\pgfqpoint{0.679cm}{1.262cm}}
\pgfpathcurveto{\pgfqpoint{0.715cm}{1.262cm}}{\pgfqpoint{0.75cm}{1.276cm}}{\pgfqpoint{0.775cm}{1.302cm}}
\pgfpathcurveto{\pgfqpoint{0.801cm}{1.328cm}}{\pgfqpoint{0.815cm}{1.363cm}}{\pgfqpoint{0.815cm}{1.399cm}}
\pgfusepath{fill}
\pgfpathmoveto{\pgfqpoint{1.345cm}{1.371cm}}
\pgfpathcurveto{\pgfqpoint{1.345cm}{1.408cm}}{\pgfqpoint{1.331cm}{1.442cm}}{\pgfqpoint{1.305cm}{1.468cm}}
\pgfpathcurveto{\pgfqpoint{1.28cm}{1.494cm}}{\pgfqpoint{1.245cm}{1.508cm}}{\pgfqpoint{1.209cm}{1.508cm}}
\pgfpathcurveto{\pgfqpoint{1.172cm}{1.508cm}}{\pgfqpoint{1.138cm}{1.494cm}}{\pgfqpoint{1.112cm}{1.468cm}}
\pgfpathcurveto{\pgfqpoint{1.087cm}{1.442cm}}{\pgfqpoint{1.072cm}{1.408cm}}{\pgfqpoint{1.072cm}{1.371cm}}
\pgfpathcurveto{\pgfqpoint{1.072cm}{1.335cm}}{\pgfqpoint{1.087cm}{1.3cm}}{\pgfqpoint{1.112cm}{1.274cm}}
\pgfpathcurveto{\pgfqpoint{1.138cm}{1.249cm}}{\pgfqpoint{1.172cm}{1.234cm}}{\pgfqpoint{1.209cm}{1.234cm}}
\pgfpathcurveto{\pgfqpoint{1.245cm}{1.234cm}}{\pgfqpoint{1.28cm}{1.249cm}}{\pgfqpoint{1.305cm}{1.274cm}}
\pgfpathcurveto{\pgfqpoint{1.331cm}{1.3cm}}{\pgfqpoint{1.345cm}{1.335cm}}{\pgfqpoint{1.345cm}{1.371cm}}
\pgfusepath{fill}
\begin{pgfscope}
\pgfsetdash{}{0cm}
\pgfsetlinewidth{0.818mm}
\pgfsetroundcap
\pgfsetmiterlimit{4.0}
\pgfpathmoveto{\pgfqpoint{0.682cm}{0.671cm}}
\pgfpathlineto{\pgfqpoint{0.682cm}{0.042cm}}
\pgfusepath{stroke}
\end{pgfscope}
\end{pgfscope}
\end{pgfscope}
\end{pgfscope}
\end{tikzpicture}}}(\phi+\psi))}+\rmb{6\UU_\leqslant X\succ(X^{\!\resizebox{0.6em}{!}{
\begin{tikzpicture}
\pgfpathmoveto{\pgfqpoint{0cm}{-0.035cm}}
\pgfpathlineto{\pgfqpoint{1.376cm}{-0.035cm}}
\pgfpathlineto{\pgfqpoint{1.376cm}{1.552cm}}
\pgfpathlineto{\pgfqpoint{0cm}{1.552cm}}
\pgfpathclose
\pgfusepath{clip}
\begin{pgfscope}
\begin{pgfscope}
\pgfpathmoveto{\pgfqpoint{0cm}{-0.035cm}}
\pgfpathlineto{\pgfqpoint{1.376cm}{-0.035cm}}
\pgfpathlineto{\pgfqpoint{1.376cm}{1.552cm}}
\pgfpathlineto{\pgfqpoint{0cm}{1.552cm}}
\pgfpathclose
\pgfusepath{clip}
\begin{pgfscope}
\begin{pgfscope}
\pgfsetdash{}{0cm}
\pgfsetlinewidth{0.818mm}
\pgfsetroundcap
\pgfsetroundjoin
\pgfsetmiterlimit{7.0}
\definecolor{eps2pgf_color}{gray}{0}\pgfsetstrokecolor{eps2pgf_color}\pgfsetfillcolor{eps2pgf_color}
\pgfpathmoveto{\pgfqpoint{0.117cm}{1.421cm}}
\pgfpathlineto{\pgfqpoint{0.682cm}{0.671cm}}
\pgfpathlineto{\pgfqpoint{1.246cm}{1.421cm}}
\pgfusepath{stroke}
\end{pgfscope}
\definecolor{eps2pgf_color}{gray}{0}\pgfsetstrokecolor{eps2pgf_color}\pgfsetfillcolor{eps2pgf_color}
\pgfpathmoveto{\pgfqpoint{0.273cm}{1.395cm}}
\pgfpathcurveto{\pgfqpoint{0.273cm}{1.432cm}}{\pgfqpoint{0.259cm}{1.467cm}}{\pgfqpoint{0.233cm}{1.492cm}}
\pgfpathcurveto{\pgfqpoint{0.207cm}{1.518cm}}{\pgfqpoint{0.173cm}{1.532cm}}{\pgfqpoint{0.137cm}{1.532cm}}
\pgfpathcurveto{\pgfqpoint{0.1cm}{1.532cm}}{\pgfqpoint{0.066cm}{1.518cm}}{\pgfqpoint{0.04cm}{1.492cm}}
\pgfpathcurveto{\pgfqpoint{0.014cm}{1.467cm}}{\pgfqpoint{0cm}{1.432cm}}{\pgfqpoint{0cm}{1.395cm}}
\pgfpathcurveto{\pgfqpoint{0cm}{1.359cm}}{\pgfqpoint{0.014cm}{1.324cm}}{\pgfqpoint{0.04cm}{1.299cm}}
\pgfpathcurveto{\pgfqpoint{0.066cm}{1.273cm}}{\pgfqpoint{0.1cm}{1.258cm}}{\pgfqpoint{0.137cm}{1.258cm}}
\pgfpathcurveto{\pgfqpoint{0.173cm}{1.258cm}}{\pgfqpoint{0.207cm}{1.273cm}}{\pgfqpoint{0.233cm}{1.299cm}}
\pgfpathcurveto{\pgfqpoint{0.259cm}{1.324cm}}{\pgfqpoint{0.273cm}{1.359cm}}{\pgfqpoint{0.273cm}{1.395cm}}
\pgfusepath{fill}
\begin{pgfscope}
\pgfsetdash{}{0cm}
\pgfsetlinewidth{0.818mm}
\pgfsetmiterlimit{7.0}
\pgfpathmoveto{\pgfqpoint{0.682cm}{0.671cm}}
\pgfpathlineto{\pgfqpoint{0.679cm}{1.418cm}}
\pgfusepath{stroke}
\end{pgfscope}
\pgfpathmoveto{\pgfqpoint{0.815cm}{1.399cm}}
\pgfpathcurveto{\pgfqpoint{0.815cm}{1.435cm}}{\pgfqpoint{0.801cm}{1.47cm}}{\pgfqpoint{0.775cm}{1.496cm}}
\pgfpathcurveto{\pgfqpoint{0.75cm}{1.521cm}}{\pgfqpoint{0.715cm}{1.536cm}}{\pgfqpoint{0.679cm}{1.536cm}}
\pgfpathcurveto{\pgfqpoint{0.643cm}{1.536cm}}{\pgfqpoint{0.608cm}{1.521cm}}{\pgfqpoint{0.582cm}{1.496cm}}
\pgfpathcurveto{\pgfqpoint{0.557cm}{1.47cm}}{\pgfqpoint{0.542cm}{1.435cm}}{\pgfqpoint{0.542cm}{1.399cm}}
\pgfpathcurveto{\pgfqpoint{0.542cm}{1.363cm}}{\pgfqpoint{0.557cm}{1.328cm}}{\pgfqpoint{0.582cm}{1.302cm}}
\pgfpathcurveto{\pgfqpoint{0.608cm}{1.276cm}}{\pgfqpoint{0.643cm}{1.262cm}}{\pgfqpoint{0.679cm}{1.262cm}}
\pgfpathcurveto{\pgfqpoint{0.715cm}{1.262cm}}{\pgfqpoint{0.75cm}{1.276cm}}{\pgfqpoint{0.775cm}{1.302cm}}
\pgfpathcurveto{\pgfqpoint{0.801cm}{1.328cm}}{\pgfqpoint{0.815cm}{1.363cm}}{\pgfqpoint{0.815cm}{1.399cm}}
\pgfusepath{fill}
\pgfpathmoveto{\pgfqpoint{1.345cm}{1.371cm}}
\pgfpathcurveto{\pgfqpoint{1.345cm}{1.408cm}}{\pgfqpoint{1.331cm}{1.442cm}}{\pgfqpoint{1.305cm}{1.468cm}}
\pgfpathcurveto{\pgfqpoint{1.28cm}{1.494cm}}{\pgfqpoint{1.245cm}{1.508cm}}{\pgfqpoint{1.209cm}{1.508cm}}
\pgfpathcurveto{\pgfqpoint{1.172cm}{1.508cm}}{\pgfqpoint{1.138cm}{1.494cm}}{\pgfqpoint{1.112cm}{1.468cm}}
\pgfpathcurveto{\pgfqpoint{1.087cm}{1.442cm}}{\pgfqpoint{1.072cm}{1.408cm}}{\pgfqpoint{1.072cm}{1.371cm}}
\pgfpathcurveto{\pgfqpoint{1.072cm}{1.335cm}}{\pgfqpoint{1.087cm}{1.3cm}}{\pgfqpoint{1.112cm}{1.274cm}}
\pgfpathcurveto{\pgfqpoint{1.138cm}{1.249cm}}{\pgfqpoint{1.172cm}{1.234cm}}{\pgfqpoint{1.209cm}{1.234cm}}
\pgfpathcurveto{\pgfqpoint{1.245cm}{1.234cm}}{\pgfqpoint{1.28cm}{1.249cm}}{\pgfqpoint{1.305cm}{1.274cm}}
\pgfpathcurveto{\pgfqpoint{1.331cm}{1.3cm}}{\pgfqpoint{1.345cm}{1.335cm}}{\pgfqpoint{1.345cm}{1.371cm}}
\pgfusepath{fill}
\begin{pgfscope}
\pgfsetdash{}{0cm}
\pgfsetlinewidth{0.818mm}
\pgfsetroundcap
\pgfsetmiterlimit{4.0}
\pgfpathmoveto{\pgfqpoint{0.682cm}{0.671cm}}
\pgfpathlineto{\pgfqpoint{0.682cm}{0.042cm}}
\pgfusepath{stroke}
\end{pgfscope}
\end{pgfscope}
\end{pgfscope}
\end{pgfscope}
\end{tikzpicture}}}(\phi+\psi))},
\end{align}
\begin{align}\label{eq:93}
\rmg{6X\prec(X^{\!\resizebox{0.6em}{!}{
\begin{tikzpicture}
\pgfpathmoveto{\pgfqpoint{0cm}{-0.035cm}}
\pgfpathlineto{\pgfqpoint{1.376cm}{-0.035cm}}
\pgfpathlineto{\pgfqpoint{1.376cm}{1.552cm}}
\pgfpathlineto{\pgfqpoint{0cm}{1.552cm}}
\pgfpathclose
\pgfusepath{clip}
\begin{pgfscope}
\begin{pgfscope}
\pgfpathmoveto{\pgfqpoint{0cm}{-0.035cm}}
\pgfpathlineto{\pgfqpoint{1.376cm}{-0.035cm}}
\pgfpathlineto{\pgfqpoint{1.376cm}{1.552cm}}
\pgfpathlineto{\pgfqpoint{0cm}{1.552cm}}
\pgfpathclose
\pgfusepath{clip}
\begin{pgfscope}
\begin{pgfscope}
\pgfsetdash{}{0cm}
\pgfsetlinewidth{0.818mm}
\pgfsetroundcap
\pgfsetroundjoin
\pgfsetmiterlimit{7.0}
\definecolor{eps2pgf_color}{gray}{0}\pgfsetstrokecolor{eps2pgf_color}\pgfsetfillcolor{eps2pgf_color}
\pgfpathmoveto{\pgfqpoint{0.117cm}{1.421cm}}
\pgfpathlineto{\pgfqpoint{0.682cm}{0.671cm}}
\pgfpathlineto{\pgfqpoint{1.246cm}{1.421cm}}
\pgfusepath{stroke}
\end{pgfscope}
\definecolor{eps2pgf_color}{gray}{0}\pgfsetstrokecolor{eps2pgf_color}\pgfsetfillcolor{eps2pgf_color}
\pgfpathmoveto{\pgfqpoint{0.273cm}{1.395cm}}
\pgfpathcurveto{\pgfqpoint{0.273cm}{1.432cm}}{\pgfqpoint{0.259cm}{1.467cm}}{\pgfqpoint{0.233cm}{1.492cm}}
\pgfpathcurveto{\pgfqpoint{0.207cm}{1.518cm}}{\pgfqpoint{0.173cm}{1.532cm}}{\pgfqpoint{0.137cm}{1.532cm}}
\pgfpathcurveto{\pgfqpoint{0.1cm}{1.532cm}}{\pgfqpoint{0.066cm}{1.518cm}}{\pgfqpoint{0.04cm}{1.492cm}}
\pgfpathcurveto{\pgfqpoint{0.014cm}{1.467cm}}{\pgfqpoint{0cm}{1.432cm}}{\pgfqpoint{0cm}{1.395cm}}
\pgfpathcurveto{\pgfqpoint{0cm}{1.359cm}}{\pgfqpoint{0.014cm}{1.324cm}}{\pgfqpoint{0.04cm}{1.299cm}}
\pgfpathcurveto{\pgfqpoint{0.066cm}{1.273cm}}{\pgfqpoint{0.1cm}{1.258cm}}{\pgfqpoint{0.137cm}{1.258cm}}
\pgfpathcurveto{\pgfqpoint{0.173cm}{1.258cm}}{\pgfqpoint{0.207cm}{1.273cm}}{\pgfqpoint{0.233cm}{1.299cm}}
\pgfpathcurveto{\pgfqpoint{0.259cm}{1.324cm}}{\pgfqpoint{0.273cm}{1.359cm}}{\pgfqpoint{0.273cm}{1.395cm}}
\pgfusepath{fill}
\begin{pgfscope}
\pgfsetdash{}{0cm}
\pgfsetlinewidth{0.818mm}
\pgfsetmiterlimit{7.0}
\pgfpathmoveto{\pgfqpoint{0.682cm}{0.671cm}}
\pgfpathlineto{\pgfqpoint{0.679cm}{1.418cm}}
\pgfusepath{stroke}
\end{pgfscope}
\pgfpathmoveto{\pgfqpoint{0.815cm}{1.399cm}}
\pgfpathcurveto{\pgfqpoint{0.815cm}{1.435cm}}{\pgfqpoint{0.801cm}{1.47cm}}{\pgfqpoint{0.775cm}{1.496cm}}
\pgfpathcurveto{\pgfqpoint{0.75cm}{1.521cm}}{\pgfqpoint{0.715cm}{1.536cm}}{\pgfqpoint{0.679cm}{1.536cm}}
\pgfpathcurveto{\pgfqpoint{0.643cm}{1.536cm}}{\pgfqpoint{0.608cm}{1.521cm}}{\pgfqpoint{0.582cm}{1.496cm}}
\pgfpathcurveto{\pgfqpoint{0.557cm}{1.47cm}}{\pgfqpoint{0.542cm}{1.435cm}}{\pgfqpoint{0.542cm}{1.399cm}}
\pgfpathcurveto{\pgfqpoint{0.542cm}{1.363cm}}{\pgfqpoint{0.557cm}{1.328cm}}{\pgfqpoint{0.582cm}{1.302cm}}
\pgfpathcurveto{\pgfqpoint{0.608cm}{1.276cm}}{\pgfqpoint{0.643cm}{1.262cm}}{\pgfqpoint{0.679cm}{1.262cm}}
\pgfpathcurveto{\pgfqpoint{0.715cm}{1.262cm}}{\pgfqpoint{0.75cm}{1.276cm}}{\pgfqpoint{0.775cm}{1.302cm}}
\pgfpathcurveto{\pgfqpoint{0.801cm}{1.328cm}}{\pgfqpoint{0.815cm}{1.363cm}}{\pgfqpoint{0.815cm}{1.399cm}}
\pgfusepath{fill}
\pgfpathmoveto{\pgfqpoint{1.345cm}{1.371cm}}
\pgfpathcurveto{\pgfqpoint{1.345cm}{1.408cm}}{\pgfqpoint{1.331cm}{1.442cm}}{\pgfqpoint{1.305cm}{1.468cm}}
\pgfpathcurveto{\pgfqpoint{1.28cm}{1.494cm}}{\pgfqpoint{1.245cm}{1.508cm}}{\pgfqpoint{1.209cm}{1.508cm}}
\pgfpathcurveto{\pgfqpoint{1.172cm}{1.508cm}}{\pgfqpoint{1.138cm}{1.494cm}}{\pgfqpoint{1.112cm}{1.468cm}}
\pgfpathcurveto{\pgfqpoint{1.087cm}{1.442cm}}{\pgfqpoint{1.072cm}{1.408cm}}{\pgfqpoint{1.072cm}{1.371cm}}
\pgfpathcurveto{\pgfqpoint{1.072cm}{1.335cm}}{\pgfqpoint{1.087cm}{1.3cm}}{\pgfqpoint{1.112cm}{1.274cm}}
\pgfpathcurveto{\pgfqpoint{1.138cm}{1.249cm}}{\pgfqpoint{1.172cm}{1.234cm}}{\pgfqpoint{1.209cm}{1.234cm}}
\pgfpathcurveto{\pgfqpoint{1.245cm}{1.234cm}}{\pgfqpoint{1.28cm}{1.249cm}}{\pgfqpoint{1.305cm}{1.274cm}}
\pgfpathcurveto{\pgfqpoint{1.331cm}{1.3cm}}{\pgfqpoint{1.345cm}{1.335cm}}{\pgfqpoint{1.345cm}{1.371cm}}
\pgfusepath{fill}
\begin{pgfscope}
\pgfsetdash{}{0cm}
\pgfsetlinewidth{0.818mm}
\pgfsetroundcap
\pgfsetmiterlimit{4.0}
\pgfpathmoveto{\pgfqpoint{0.682cm}{0.671cm}}
\pgfpathlineto{\pgfqpoint{0.682cm}{0.042cm}}
\pgfusepath{stroke}
\end{pgfscope}
\end{pgfscope}
\end{pgfscope}
\end{pgfscope}
\end{tikzpicture}}}(\phi+\psi))}=\rmm{6\UU_> X\prec(X^{\!\resizebox{0.6em}{!}{
\begin{tikzpicture}
\pgfpathmoveto{\pgfqpoint{0cm}{-0.035cm}}
\pgfpathlineto{\pgfqpoint{1.376cm}{-0.035cm}}
\pgfpathlineto{\pgfqpoint{1.376cm}{1.552cm}}
\pgfpathlineto{\pgfqpoint{0cm}{1.552cm}}
\pgfpathclose
\pgfusepath{clip}
\begin{pgfscope}
\begin{pgfscope}
\pgfpathmoveto{\pgfqpoint{0cm}{-0.035cm}}
\pgfpathlineto{\pgfqpoint{1.376cm}{-0.035cm}}
\pgfpathlineto{\pgfqpoint{1.376cm}{1.552cm}}
\pgfpathlineto{\pgfqpoint{0cm}{1.552cm}}
\pgfpathclose
\pgfusepath{clip}
\begin{pgfscope}
\begin{pgfscope}
\pgfsetdash{}{0cm}
\pgfsetlinewidth{0.818mm}
\pgfsetroundcap
\pgfsetroundjoin
\pgfsetmiterlimit{7.0}
\definecolor{eps2pgf_color}{gray}{0}\pgfsetstrokecolor{eps2pgf_color}\pgfsetfillcolor{eps2pgf_color}
\pgfpathmoveto{\pgfqpoint{0.117cm}{1.421cm}}
\pgfpathlineto{\pgfqpoint{0.682cm}{0.671cm}}
\pgfpathlineto{\pgfqpoint{1.246cm}{1.421cm}}
\pgfusepath{stroke}
\end{pgfscope}
\definecolor{eps2pgf_color}{gray}{0}\pgfsetstrokecolor{eps2pgf_color}\pgfsetfillcolor{eps2pgf_color}
\pgfpathmoveto{\pgfqpoint{0.273cm}{1.395cm}}
\pgfpathcurveto{\pgfqpoint{0.273cm}{1.432cm}}{\pgfqpoint{0.259cm}{1.467cm}}{\pgfqpoint{0.233cm}{1.492cm}}
\pgfpathcurveto{\pgfqpoint{0.207cm}{1.518cm}}{\pgfqpoint{0.173cm}{1.532cm}}{\pgfqpoint{0.137cm}{1.532cm}}
\pgfpathcurveto{\pgfqpoint{0.1cm}{1.532cm}}{\pgfqpoint{0.066cm}{1.518cm}}{\pgfqpoint{0.04cm}{1.492cm}}
\pgfpathcurveto{\pgfqpoint{0.014cm}{1.467cm}}{\pgfqpoint{0cm}{1.432cm}}{\pgfqpoint{0cm}{1.395cm}}
\pgfpathcurveto{\pgfqpoint{0cm}{1.359cm}}{\pgfqpoint{0.014cm}{1.324cm}}{\pgfqpoint{0.04cm}{1.299cm}}
\pgfpathcurveto{\pgfqpoint{0.066cm}{1.273cm}}{\pgfqpoint{0.1cm}{1.258cm}}{\pgfqpoint{0.137cm}{1.258cm}}
\pgfpathcurveto{\pgfqpoint{0.173cm}{1.258cm}}{\pgfqpoint{0.207cm}{1.273cm}}{\pgfqpoint{0.233cm}{1.299cm}}
\pgfpathcurveto{\pgfqpoint{0.259cm}{1.324cm}}{\pgfqpoint{0.273cm}{1.359cm}}{\pgfqpoint{0.273cm}{1.395cm}}
\pgfusepath{fill}
\begin{pgfscope}
\pgfsetdash{}{0cm}
\pgfsetlinewidth{0.818mm}
\pgfsetmiterlimit{7.0}
\pgfpathmoveto{\pgfqpoint{0.682cm}{0.671cm}}
\pgfpathlineto{\pgfqpoint{0.679cm}{1.418cm}}
\pgfusepath{stroke}
\end{pgfscope}
\pgfpathmoveto{\pgfqpoint{0.815cm}{1.399cm}}
\pgfpathcurveto{\pgfqpoint{0.815cm}{1.435cm}}{\pgfqpoint{0.801cm}{1.47cm}}{\pgfqpoint{0.775cm}{1.496cm}}
\pgfpathcurveto{\pgfqpoint{0.75cm}{1.521cm}}{\pgfqpoint{0.715cm}{1.536cm}}{\pgfqpoint{0.679cm}{1.536cm}}
\pgfpathcurveto{\pgfqpoint{0.643cm}{1.536cm}}{\pgfqpoint{0.608cm}{1.521cm}}{\pgfqpoint{0.582cm}{1.496cm}}
\pgfpathcurveto{\pgfqpoint{0.557cm}{1.47cm}}{\pgfqpoint{0.542cm}{1.435cm}}{\pgfqpoint{0.542cm}{1.399cm}}
\pgfpathcurveto{\pgfqpoint{0.542cm}{1.363cm}}{\pgfqpoint{0.557cm}{1.328cm}}{\pgfqpoint{0.582cm}{1.302cm}}
\pgfpathcurveto{\pgfqpoint{0.608cm}{1.276cm}}{\pgfqpoint{0.643cm}{1.262cm}}{\pgfqpoint{0.679cm}{1.262cm}}
\pgfpathcurveto{\pgfqpoint{0.715cm}{1.262cm}}{\pgfqpoint{0.75cm}{1.276cm}}{\pgfqpoint{0.775cm}{1.302cm}}
\pgfpathcurveto{\pgfqpoint{0.801cm}{1.328cm}}{\pgfqpoint{0.815cm}{1.363cm}}{\pgfqpoint{0.815cm}{1.399cm}}
\pgfusepath{fill}
\pgfpathmoveto{\pgfqpoint{1.345cm}{1.371cm}}
\pgfpathcurveto{\pgfqpoint{1.345cm}{1.408cm}}{\pgfqpoint{1.331cm}{1.442cm}}{\pgfqpoint{1.305cm}{1.468cm}}
\pgfpathcurveto{\pgfqpoint{1.28cm}{1.494cm}}{\pgfqpoint{1.245cm}{1.508cm}}{\pgfqpoint{1.209cm}{1.508cm}}
\pgfpathcurveto{\pgfqpoint{1.172cm}{1.508cm}}{\pgfqpoint{1.138cm}{1.494cm}}{\pgfqpoint{1.112cm}{1.468cm}}
\pgfpathcurveto{\pgfqpoint{1.087cm}{1.442cm}}{\pgfqpoint{1.072cm}{1.408cm}}{\pgfqpoint{1.072cm}{1.371cm}}
\pgfpathcurveto{\pgfqpoint{1.072cm}{1.335cm}}{\pgfqpoint{1.087cm}{1.3cm}}{\pgfqpoint{1.112cm}{1.274cm}}
\pgfpathcurveto{\pgfqpoint{1.138cm}{1.249cm}}{\pgfqpoint{1.172cm}{1.234cm}}{\pgfqpoint{1.209cm}{1.234cm}}
\pgfpathcurveto{\pgfqpoint{1.245cm}{1.234cm}}{\pgfqpoint{1.28cm}{1.249cm}}{\pgfqpoint{1.305cm}{1.274cm}}
\pgfpathcurveto{\pgfqpoint{1.331cm}{1.3cm}}{\pgfqpoint{1.345cm}{1.335cm}}{\pgfqpoint{1.345cm}{1.371cm}}
\pgfusepath{fill}
\begin{pgfscope}
\pgfsetdash{}{0cm}
\pgfsetlinewidth{0.818mm}
\pgfsetroundcap
\pgfsetmiterlimit{4.0}
\pgfpathmoveto{\pgfqpoint{0.682cm}{0.671cm}}
\pgfpathlineto{\pgfqpoint{0.682cm}{0.042cm}}
\pgfusepath{stroke}
\end{pgfscope}
\end{pgfscope}
\end{pgfscope}
\end{pgfscope}
\end{tikzpicture}}}(\phi+\psi))}+\rmb{6\UU_\leqslant X\prec(X^{\!\resizebox{0.6em}{!}{
\begin{tikzpicture}
\pgfpathmoveto{\pgfqpoint{0cm}{-0.035cm}}
\pgfpathlineto{\pgfqpoint{1.376cm}{-0.035cm}}
\pgfpathlineto{\pgfqpoint{1.376cm}{1.552cm}}
\pgfpathlineto{\pgfqpoint{0cm}{1.552cm}}
\pgfpathclose
\pgfusepath{clip}
\begin{pgfscope}
\begin{pgfscope}
\pgfpathmoveto{\pgfqpoint{0cm}{-0.035cm}}
\pgfpathlineto{\pgfqpoint{1.376cm}{-0.035cm}}
\pgfpathlineto{\pgfqpoint{1.376cm}{1.552cm}}
\pgfpathlineto{\pgfqpoint{0cm}{1.552cm}}
\pgfpathclose
\pgfusepath{clip}
\begin{pgfscope}
\begin{pgfscope}
\pgfsetdash{}{0cm}
\pgfsetlinewidth{0.818mm}
\pgfsetroundcap
\pgfsetroundjoin
\pgfsetmiterlimit{7.0}
\definecolor{eps2pgf_color}{gray}{0}\pgfsetstrokecolor{eps2pgf_color}\pgfsetfillcolor{eps2pgf_color}
\pgfpathmoveto{\pgfqpoint{0.117cm}{1.421cm}}
\pgfpathlineto{\pgfqpoint{0.682cm}{0.671cm}}
\pgfpathlineto{\pgfqpoint{1.246cm}{1.421cm}}
\pgfusepath{stroke}
\end{pgfscope}
\definecolor{eps2pgf_color}{gray}{0}\pgfsetstrokecolor{eps2pgf_color}\pgfsetfillcolor{eps2pgf_color}
\pgfpathmoveto{\pgfqpoint{0.273cm}{1.395cm}}
\pgfpathcurveto{\pgfqpoint{0.273cm}{1.432cm}}{\pgfqpoint{0.259cm}{1.467cm}}{\pgfqpoint{0.233cm}{1.492cm}}
\pgfpathcurveto{\pgfqpoint{0.207cm}{1.518cm}}{\pgfqpoint{0.173cm}{1.532cm}}{\pgfqpoint{0.137cm}{1.532cm}}
\pgfpathcurveto{\pgfqpoint{0.1cm}{1.532cm}}{\pgfqpoint{0.066cm}{1.518cm}}{\pgfqpoint{0.04cm}{1.492cm}}
\pgfpathcurveto{\pgfqpoint{0.014cm}{1.467cm}}{\pgfqpoint{0cm}{1.432cm}}{\pgfqpoint{0cm}{1.395cm}}
\pgfpathcurveto{\pgfqpoint{0cm}{1.359cm}}{\pgfqpoint{0.014cm}{1.324cm}}{\pgfqpoint{0.04cm}{1.299cm}}
\pgfpathcurveto{\pgfqpoint{0.066cm}{1.273cm}}{\pgfqpoint{0.1cm}{1.258cm}}{\pgfqpoint{0.137cm}{1.258cm}}
\pgfpathcurveto{\pgfqpoint{0.173cm}{1.258cm}}{\pgfqpoint{0.207cm}{1.273cm}}{\pgfqpoint{0.233cm}{1.299cm}}
\pgfpathcurveto{\pgfqpoint{0.259cm}{1.324cm}}{\pgfqpoint{0.273cm}{1.359cm}}{\pgfqpoint{0.273cm}{1.395cm}}
\pgfusepath{fill}
\begin{pgfscope}
\pgfsetdash{}{0cm}
\pgfsetlinewidth{0.818mm}
\pgfsetmiterlimit{7.0}
\pgfpathmoveto{\pgfqpoint{0.682cm}{0.671cm}}
\pgfpathlineto{\pgfqpoint{0.679cm}{1.418cm}}
\pgfusepath{stroke}
\end{pgfscope}
\pgfpathmoveto{\pgfqpoint{0.815cm}{1.399cm}}
\pgfpathcurveto{\pgfqpoint{0.815cm}{1.435cm}}{\pgfqpoint{0.801cm}{1.47cm}}{\pgfqpoint{0.775cm}{1.496cm}}
\pgfpathcurveto{\pgfqpoint{0.75cm}{1.521cm}}{\pgfqpoint{0.715cm}{1.536cm}}{\pgfqpoint{0.679cm}{1.536cm}}
\pgfpathcurveto{\pgfqpoint{0.643cm}{1.536cm}}{\pgfqpoint{0.608cm}{1.521cm}}{\pgfqpoint{0.582cm}{1.496cm}}
\pgfpathcurveto{\pgfqpoint{0.557cm}{1.47cm}}{\pgfqpoint{0.542cm}{1.435cm}}{\pgfqpoint{0.542cm}{1.399cm}}
\pgfpathcurveto{\pgfqpoint{0.542cm}{1.363cm}}{\pgfqpoint{0.557cm}{1.328cm}}{\pgfqpoint{0.582cm}{1.302cm}}
\pgfpathcurveto{\pgfqpoint{0.608cm}{1.276cm}}{\pgfqpoint{0.643cm}{1.262cm}}{\pgfqpoint{0.679cm}{1.262cm}}
\pgfpathcurveto{\pgfqpoint{0.715cm}{1.262cm}}{\pgfqpoint{0.75cm}{1.276cm}}{\pgfqpoint{0.775cm}{1.302cm}}
\pgfpathcurveto{\pgfqpoint{0.801cm}{1.328cm}}{\pgfqpoint{0.815cm}{1.363cm}}{\pgfqpoint{0.815cm}{1.399cm}}
\pgfusepath{fill}
\pgfpathmoveto{\pgfqpoint{1.345cm}{1.371cm}}
\pgfpathcurveto{\pgfqpoint{1.345cm}{1.408cm}}{\pgfqpoint{1.331cm}{1.442cm}}{\pgfqpoint{1.305cm}{1.468cm}}
\pgfpathcurveto{\pgfqpoint{1.28cm}{1.494cm}}{\pgfqpoint{1.245cm}{1.508cm}}{\pgfqpoint{1.209cm}{1.508cm}}
\pgfpathcurveto{\pgfqpoint{1.172cm}{1.508cm}}{\pgfqpoint{1.138cm}{1.494cm}}{\pgfqpoint{1.112cm}{1.468cm}}
\pgfpathcurveto{\pgfqpoint{1.087cm}{1.442cm}}{\pgfqpoint{1.072cm}{1.408cm}}{\pgfqpoint{1.072cm}{1.371cm}}
\pgfpathcurveto{\pgfqpoint{1.072cm}{1.335cm}}{\pgfqpoint{1.087cm}{1.3cm}}{\pgfqpoint{1.112cm}{1.274cm}}
\pgfpathcurveto{\pgfqpoint{1.138cm}{1.249cm}}{\pgfqpoint{1.172cm}{1.234cm}}{\pgfqpoint{1.209cm}{1.234cm}}
\pgfpathcurveto{\pgfqpoint{1.245cm}{1.234cm}}{\pgfqpoint{1.28cm}{1.249cm}}{\pgfqpoint{1.305cm}{1.274cm}}
\pgfpathcurveto{\pgfqpoint{1.331cm}{1.3cm}}{\pgfqpoint{1.345cm}{1.335cm}}{\pgfqpoint{1.345cm}{1.371cm}}
\pgfusepath{fill}
\begin{pgfscope}
\pgfsetdash{}{0cm}
\pgfsetlinewidth{0.818mm}
\pgfsetroundcap
\pgfsetmiterlimit{4.0}
\pgfpathmoveto{\pgfqpoint{0.682cm}{0.671cm}}
\pgfpathlineto{\pgfqpoint{0.682cm}{0.042cm}}
\pgfusepath{stroke}
\end{pgfscope}
\end{pgfscope}
\end{pgfscope}
\end{pgfscope}
\end{tikzpicture}}}(\phi+\psi))},
\end{align}
\begin{align*}
\rmg{6(\phi+\psi)X^{\!\resizebox{!}{.8em}{
\begin{tikzpicture}
\pgfpathmoveto{\pgfqpoint{0cm}{-0.035cm}}
\pgfpathlineto{\pgfqpoint{1.976cm}{-0.035cm}}
\pgfpathlineto{\pgfqpoint{1.976cm}{1.94cm}}
\pgfpathlineto{\pgfqpoint{0cm}{1.94cm}}
\pgfpathclose
\pgfusepath{clip}
\begin{pgfscope}
\begin{pgfscope}
\pgfpathmoveto{\pgfqpoint{0cm}{-0.035cm}}
\pgfpathlineto{\pgfqpoint{1.976cm}{-0.035cm}}
\pgfpathlineto{\pgfqpoint{1.976cm}{1.94cm}}
\pgfpathlineto{\pgfqpoint{0cm}{1.94cm}}
\pgfpathclose
\pgfusepath{clip}
\begin{pgfscope}
\begin{pgfscope}
\pgfsetdash{}{0cm}
\pgfsetlinewidth{0.818mm}
\pgfsetroundcap
\pgfsetroundjoin
\pgfsetmiterlimit{7.0}
\definecolor{eps2pgf_color}{gray}{0}\pgfsetstrokecolor{eps2pgf_color}\pgfsetfillcolor{eps2pgf_color}
\pgfpathmoveto{\pgfqpoint{0.117cm}{1.815cm}}
\pgfpathlineto{\pgfqpoint{0.682cm}{1.065cm}}
\pgfpathlineto{\pgfqpoint{1.246cm}{1.815cm}}
\pgfusepath{stroke}
\end{pgfscope}
\definecolor{eps2pgf_color}{gray}{0}\pgfsetstrokecolor{eps2pgf_color}\pgfsetfillcolor{eps2pgf_color}
\pgfpathmoveto{\pgfqpoint{0.273cm}{1.789cm}}
\pgfpathcurveto{\pgfqpoint{0.273cm}{1.825cm}}{\pgfqpoint{0.259cm}{1.86cm}}{\pgfqpoint{0.233cm}{1.886cm}}
\pgfpathcurveto{\pgfqpoint{0.207cm}{1.912cm}}{\pgfqpoint{0.173cm}{1.926cm}}{\pgfqpoint{0.137cm}{1.926cm}}
\pgfpathcurveto{\pgfqpoint{0.1cm}{1.926cm}}{\pgfqpoint{0.066cm}{1.912cm}}{\pgfqpoint{0.04cm}{1.886cm}}
\pgfpathcurveto{\pgfqpoint{0.014cm}{1.86cm}}{\pgfqpoint{0cm}{1.825cm}}{\pgfqpoint{0cm}{1.789cm}}
\pgfpathcurveto{\pgfqpoint{0cm}{1.753cm}}{\pgfqpoint{0.014cm}{1.718cm}}{\pgfqpoint{0.04cm}{1.692cm}}
\pgfpathcurveto{\pgfqpoint{0.066cm}{1.667cm}}{\pgfqpoint{0.1cm}{1.652cm}}{\pgfqpoint{0.137cm}{1.652cm}}
\pgfpathcurveto{\pgfqpoint{0.173cm}{1.652cm}}{\pgfqpoint{0.207cm}{1.667cm}}{\pgfqpoint{0.233cm}{1.692cm}}
\pgfpathcurveto{\pgfqpoint{0.259cm}{1.718cm}}{\pgfqpoint{0.273cm}{1.753cm}}{\pgfqpoint{0.273cm}{1.789cm}}
\pgfusepath{fill}
\begin{pgfscope}
\pgfsetdash{}{0cm}
\pgfsetlinewidth{0.818mm}
\pgfsetmiterlimit{7.0}
\pgfpathmoveto{\pgfqpoint{0.682cm}{1.065cm}}
\pgfpathlineto{\pgfqpoint{0.679cm}{1.812cm}}
\pgfusepath{stroke}
\end{pgfscope}
\pgfpathmoveto{\pgfqpoint{0.815cm}{1.793cm}}
\pgfpathcurveto{\pgfqpoint{0.815cm}{1.829cm}}{\pgfqpoint{0.801cm}{1.864cm}}{\pgfqpoint{0.775cm}{1.89cm}}
\pgfpathcurveto{\pgfqpoint{0.75cm}{1.915cm}}{\pgfqpoint{0.715cm}{1.93cm}}{\pgfqpoint{0.679cm}{1.93cm}}
\pgfpathcurveto{\pgfqpoint{0.643cm}{1.93cm}}{\pgfqpoint{0.608cm}{1.915cm}}{\pgfqpoint{0.582cm}{1.89cm}}
\pgfpathcurveto{\pgfqpoint{0.557cm}{1.864cm}}{\pgfqpoint{0.542cm}{1.829cm}}{\pgfqpoint{0.542cm}{1.793cm}}
\pgfpathcurveto{\pgfqpoint{0.542cm}{1.756cm}}{\pgfqpoint{0.557cm}{1.722cm}}{\pgfqpoint{0.582cm}{1.696cm}}
\pgfpathcurveto{\pgfqpoint{0.608cm}{1.67cm}}{\pgfqpoint{0.643cm}{1.656cm}}{\pgfqpoint{0.679cm}{1.656cm}}
\pgfpathcurveto{\pgfqpoint{0.715cm}{1.656cm}}{\pgfqpoint{0.75cm}{1.67cm}}{\pgfqpoint{0.775cm}{1.696cm}}
\pgfpathcurveto{\pgfqpoint{0.801cm}{1.722cm}}{\pgfqpoint{0.815cm}{1.756cm}}{\pgfqpoint{0.815cm}{1.793cm}}
\pgfusepath{fill}
\pgfpathmoveto{\pgfqpoint{1.345cm}{1.765cm}}
\pgfpathcurveto{\pgfqpoint{1.345cm}{1.801cm}}{\pgfqpoint{1.331cm}{1.836cm}}{\pgfqpoint{1.305cm}{1.862cm}}
\pgfpathcurveto{\pgfqpoint{1.28cm}{1.887cm}}{\pgfqpoint{1.245cm}{1.902cm}}{\pgfqpoint{1.209cm}{1.902cm}}
\pgfpathcurveto{\pgfqpoint{1.172cm}{1.902cm}}{\pgfqpoint{1.138cm}{1.887cm}}{\pgfqpoint{1.112cm}{1.862cm}}
\pgfpathcurveto{\pgfqpoint{1.087cm}{1.836cm}}{\pgfqpoint{1.072cm}{1.801cm}}{\pgfqpoint{1.072cm}{1.765cm}}
\pgfpathcurveto{\pgfqpoint{1.072cm}{1.728cm}}{\pgfqpoint{1.087cm}{1.694cm}}{\pgfqpoint{1.112cm}{1.668cm}}
\pgfpathcurveto{\pgfqpoint{1.138cm}{1.642cm}}{\pgfqpoint{1.172cm}{1.628cm}}{\pgfqpoint{1.209cm}{1.628cm}}
\pgfpathcurveto{\pgfqpoint{1.245cm}{1.628cm}}{\pgfqpoint{1.28cm}{1.642cm}}{\pgfqpoint{1.305cm}{1.668cm}}
\pgfpathcurveto{\pgfqpoint{1.331cm}{1.694cm}}{\pgfqpoint{1.345cm}{1.728cm}}{\pgfqpoint{1.345cm}{1.765cm}}
\pgfusepath{fill}
\begin{pgfscope}
\pgfsetdash{}{0cm}
\pgfsetlinewidth{0.818mm}
\pgfsetroundcap
\pgfsetroundjoin
\pgfsetmiterlimit{7.0}
\pgfpathmoveto{\pgfqpoint{0.682cm}{1.065cm}}
\pgfpathlineto{\pgfqpoint{1.246cm}{0.315cm}}
\pgfpathlineto{\pgfqpoint{1.811cm}{1.065cm}}
\pgfusepath{stroke}
\end{pgfscope}
\pgfpathmoveto{\pgfqpoint{1.948cm}{1.065cm}}
\pgfpathcurveto{\pgfqpoint{1.948cm}{1.101cm}}{\pgfqpoint{1.933cm}{1.136cm}}{\pgfqpoint{1.907cm}{1.162cm}}
\pgfpathcurveto{\pgfqpoint{1.882cm}{1.187cm}}{\pgfqpoint{1.847cm}{1.202cm}}{\pgfqpoint{1.811cm}{1.202cm}}
\pgfpathcurveto{\pgfqpoint{1.775cm}{1.202cm}}{\pgfqpoint{1.74cm}{1.187cm}}{\pgfqpoint{1.714cm}{1.162cm}}
\pgfpathcurveto{\pgfqpoint{1.689cm}{1.136cm}}{\pgfqpoint{1.674cm}{1.101cm}}{\pgfqpoint{1.674cm}{1.065cm}}
\pgfpathcurveto{\pgfqpoint{1.674cm}{1.029cm}}{\pgfqpoint{1.689cm}{0.994cm}}{\pgfqpoint{1.714cm}{0.968cm}}
\pgfpathcurveto{\pgfqpoint{1.74cm}{0.942cm}}{\pgfqpoint{1.775cm}{0.928cm}}{\pgfqpoint{1.811cm}{0.928cm}}
\pgfpathcurveto{\pgfqpoint{1.847cm}{0.928cm}}{\pgfqpoint{1.882cm}{0.942cm}}{\pgfqpoint{1.907cm}{0.968cm}}
\pgfpathcurveto{\pgfqpoint{1.933cm}{0.994cm}}{\pgfqpoint{1.948cm}{1.029cm}}{\pgfqpoint{1.948cm}{1.065cm}}
\pgfusepath{fill}
\begin{pgfscope}
\pgfsetdash{}{0cm}
\pgfsetlinewidth{0.818mm}
\pgfsetmiterlimit{4.0}
\pgfpathmoveto{\pgfqpoint{1.383cm}{0.178cm}}
\pgfpathcurveto{\pgfqpoint{1.383cm}{0.214cm}}{\pgfqpoint{1.369cm}{0.249cm}}{\pgfqpoint{1.343cm}{0.275cm}}
\pgfpathcurveto{\pgfqpoint{1.317cm}{0.3cm}}{\pgfqpoint{1.283cm}{0.315cm}}{\pgfqpoint{1.246cm}{0.315cm}}
\pgfpathcurveto{\pgfqpoint{1.21cm}{0.315cm}}{\pgfqpoint{1.175cm}{0.3cm}}{\pgfqpoint{1.15cm}{0.275cm}}
\pgfpathcurveto{\pgfqpoint{1.124cm}{0.249cm}}{\pgfqpoint{1.11cm}{0.214cm}}{\pgfqpoint{1.11cm}{0.178cm}}
\pgfpathcurveto{\pgfqpoint{1.11cm}{0.141cm}}{\pgfqpoint{1.124cm}{0.107cm}}{\pgfqpoint{1.15cm}{0.081cm}}
\pgfpathcurveto{\pgfqpoint{1.175cm}{0.055cm}}{\pgfqpoint{1.21cm}{0.041cm}}{\pgfqpoint{1.246cm}{0.041cm}}
\pgfpathcurveto{\pgfqpoint{1.283cm}{0.041cm}}{\pgfqpoint{1.317cm}{0.055cm}}{\pgfqpoint{1.343cm}{0.081cm}}
\pgfpathcurveto{\pgfqpoint{1.369cm}{0.107cm}}{\pgfqpoint{1.383cm}{0.141cm}}{\pgfqpoint{1.383cm}{0.178cm}}
\pgfusepath{stroke}
\end{pgfscope}
\end{pgfscope}
\end{pgfscope}
\end{pgfscope}
\end{tikzpicture}}}}=\rmm{6(\phi+\psi)\prec\UU_{>}X^{\!\resizebox{!}{.8em}{
\begin{tikzpicture}
\pgfpathmoveto{\pgfqpoint{0cm}{-0.035cm}}
\pgfpathlineto{\pgfqpoint{1.976cm}{-0.035cm}}
\pgfpathlineto{\pgfqpoint{1.976cm}{1.94cm}}
\pgfpathlineto{\pgfqpoint{0cm}{1.94cm}}
\pgfpathclose
\pgfusepath{clip}
\begin{pgfscope}
\begin{pgfscope}
\pgfpathmoveto{\pgfqpoint{0cm}{-0.035cm}}
\pgfpathlineto{\pgfqpoint{1.976cm}{-0.035cm}}
\pgfpathlineto{\pgfqpoint{1.976cm}{1.94cm}}
\pgfpathlineto{\pgfqpoint{0cm}{1.94cm}}
\pgfpathclose
\pgfusepath{clip}
\begin{pgfscope}
\begin{pgfscope}
\pgfsetdash{}{0cm}
\pgfsetlinewidth{0.818mm}
\pgfsetroundcap
\pgfsetroundjoin
\pgfsetmiterlimit{7.0}
\definecolor{eps2pgf_color}{gray}{0}\pgfsetstrokecolor{eps2pgf_color}\pgfsetfillcolor{eps2pgf_color}
\pgfpathmoveto{\pgfqpoint{0.117cm}{1.815cm}}
\pgfpathlineto{\pgfqpoint{0.682cm}{1.065cm}}
\pgfpathlineto{\pgfqpoint{1.246cm}{1.815cm}}
\pgfusepath{stroke}
\end{pgfscope}
\definecolor{eps2pgf_color}{gray}{0}\pgfsetstrokecolor{eps2pgf_color}\pgfsetfillcolor{eps2pgf_color}
\pgfpathmoveto{\pgfqpoint{0.273cm}{1.789cm}}
\pgfpathcurveto{\pgfqpoint{0.273cm}{1.825cm}}{\pgfqpoint{0.259cm}{1.86cm}}{\pgfqpoint{0.233cm}{1.886cm}}
\pgfpathcurveto{\pgfqpoint{0.207cm}{1.912cm}}{\pgfqpoint{0.173cm}{1.926cm}}{\pgfqpoint{0.137cm}{1.926cm}}
\pgfpathcurveto{\pgfqpoint{0.1cm}{1.926cm}}{\pgfqpoint{0.066cm}{1.912cm}}{\pgfqpoint{0.04cm}{1.886cm}}
\pgfpathcurveto{\pgfqpoint{0.014cm}{1.86cm}}{\pgfqpoint{0cm}{1.825cm}}{\pgfqpoint{0cm}{1.789cm}}
\pgfpathcurveto{\pgfqpoint{0cm}{1.753cm}}{\pgfqpoint{0.014cm}{1.718cm}}{\pgfqpoint{0.04cm}{1.692cm}}
\pgfpathcurveto{\pgfqpoint{0.066cm}{1.667cm}}{\pgfqpoint{0.1cm}{1.652cm}}{\pgfqpoint{0.137cm}{1.652cm}}
\pgfpathcurveto{\pgfqpoint{0.173cm}{1.652cm}}{\pgfqpoint{0.207cm}{1.667cm}}{\pgfqpoint{0.233cm}{1.692cm}}
\pgfpathcurveto{\pgfqpoint{0.259cm}{1.718cm}}{\pgfqpoint{0.273cm}{1.753cm}}{\pgfqpoint{0.273cm}{1.789cm}}
\pgfusepath{fill}
\begin{pgfscope}
\pgfsetdash{}{0cm}
\pgfsetlinewidth{0.818mm}
\pgfsetmiterlimit{7.0}
\pgfpathmoveto{\pgfqpoint{0.682cm}{1.065cm}}
\pgfpathlineto{\pgfqpoint{0.679cm}{1.812cm}}
\pgfusepath{stroke}
\end{pgfscope}
\pgfpathmoveto{\pgfqpoint{0.815cm}{1.793cm}}
\pgfpathcurveto{\pgfqpoint{0.815cm}{1.829cm}}{\pgfqpoint{0.801cm}{1.864cm}}{\pgfqpoint{0.775cm}{1.89cm}}
\pgfpathcurveto{\pgfqpoint{0.75cm}{1.915cm}}{\pgfqpoint{0.715cm}{1.93cm}}{\pgfqpoint{0.679cm}{1.93cm}}
\pgfpathcurveto{\pgfqpoint{0.643cm}{1.93cm}}{\pgfqpoint{0.608cm}{1.915cm}}{\pgfqpoint{0.582cm}{1.89cm}}
\pgfpathcurveto{\pgfqpoint{0.557cm}{1.864cm}}{\pgfqpoint{0.542cm}{1.829cm}}{\pgfqpoint{0.542cm}{1.793cm}}
\pgfpathcurveto{\pgfqpoint{0.542cm}{1.756cm}}{\pgfqpoint{0.557cm}{1.722cm}}{\pgfqpoint{0.582cm}{1.696cm}}
\pgfpathcurveto{\pgfqpoint{0.608cm}{1.67cm}}{\pgfqpoint{0.643cm}{1.656cm}}{\pgfqpoint{0.679cm}{1.656cm}}
\pgfpathcurveto{\pgfqpoint{0.715cm}{1.656cm}}{\pgfqpoint{0.75cm}{1.67cm}}{\pgfqpoint{0.775cm}{1.696cm}}
\pgfpathcurveto{\pgfqpoint{0.801cm}{1.722cm}}{\pgfqpoint{0.815cm}{1.756cm}}{\pgfqpoint{0.815cm}{1.793cm}}
\pgfusepath{fill}
\pgfpathmoveto{\pgfqpoint{1.345cm}{1.765cm}}
\pgfpathcurveto{\pgfqpoint{1.345cm}{1.801cm}}{\pgfqpoint{1.331cm}{1.836cm}}{\pgfqpoint{1.305cm}{1.862cm}}
\pgfpathcurveto{\pgfqpoint{1.28cm}{1.887cm}}{\pgfqpoint{1.245cm}{1.902cm}}{\pgfqpoint{1.209cm}{1.902cm}}
\pgfpathcurveto{\pgfqpoint{1.172cm}{1.902cm}}{\pgfqpoint{1.138cm}{1.887cm}}{\pgfqpoint{1.112cm}{1.862cm}}
\pgfpathcurveto{\pgfqpoint{1.087cm}{1.836cm}}{\pgfqpoint{1.072cm}{1.801cm}}{\pgfqpoint{1.072cm}{1.765cm}}
\pgfpathcurveto{\pgfqpoint{1.072cm}{1.728cm}}{\pgfqpoint{1.087cm}{1.694cm}}{\pgfqpoint{1.112cm}{1.668cm}}
\pgfpathcurveto{\pgfqpoint{1.138cm}{1.642cm}}{\pgfqpoint{1.172cm}{1.628cm}}{\pgfqpoint{1.209cm}{1.628cm}}
\pgfpathcurveto{\pgfqpoint{1.245cm}{1.628cm}}{\pgfqpoint{1.28cm}{1.642cm}}{\pgfqpoint{1.305cm}{1.668cm}}
\pgfpathcurveto{\pgfqpoint{1.331cm}{1.694cm}}{\pgfqpoint{1.345cm}{1.728cm}}{\pgfqpoint{1.345cm}{1.765cm}}
\pgfusepath{fill}
\begin{pgfscope}
\pgfsetdash{}{0cm}
\pgfsetlinewidth{0.818mm}
\pgfsetroundcap
\pgfsetroundjoin
\pgfsetmiterlimit{7.0}
\pgfpathmoveto{\pgfqpoint{0.682cm}{1.065cm}}
\pgfpathlineto{\pgfqpoint{1.246cm}{0.315cm}}
\pgfpathlineto{\pgfqpoint{1.811cm}{1.065cm}}
\pgfusepath{stroke}
\end{pgfscope}
\pgfpathmoveto{\pgfqpoint{1.948cm}{1.065cm}}
\pgfpathcurveto{\pgfqpoint{1.948cm}{1.101cm}}{\pgfqpoint{1.933cm}{1.136cm}}{\pgfqpoint{1.907cm}{1.162cm}}
\pgfpathcurveto{\pgfqpoint{1.882cm}{1.187cm}}{\pgfqpoint{1.847cm}{1.202cm}}{\pgfqpoint{1.811cm}{1.202cm}}
\pgfpathcurveto{\pgfqpoint{1.775cm}{1.202cm}}{\pgfqpoint{1.74cm}{1.187cm}}{\pgfqpoint{1.714cm}{1.162cm}}
\pgfpathcurveto{\pgfqpoint{1.689cm}{1.136cm}}{\pgfqpoint{1.674cm}{1.101cm}}{\pgfqpoint{1.674cm}{1.065cm}}
\pgfpathcurveto{\pgfqpoint{1.674cm}{1.029cm}}{\pgfqpoint{1.689cm}{0.994cm}}{\pgfqpoint{1.714cm}{0.968cm}}
\pgfpathcurveto{\pgfqpoint{1.74cm}{0.942cm}}{\pgfqpoint{1.775cm}{0.928cm}}{\pgfqpoint{1.811cm}{0.928cm}}
\pgfpathcurveto{\pgfqpoint{1.847cm}{0.928cm}}{\pgfqpoint{1.882cm}{0.942cm}}{\pgfqpoint{1.907cm}{0.968cm}}
\pgfpathcurveto{\pgfqpoint{1.933cm}{0.994cm}}{\pgfqpoint{1.948cm}{1.029cm}}{\pgfqpoint{1.948cm}{1.065cm}}
\pgfusepath{fill}
\begin{pgfscope}
\pgfsetdash{}{0cm}
\pgfsetlinewidth{0.818mm}
\pgfsetmiterlimit{4.0}
\pgfpathmoveto{\pgfqpoint{1.383cm}{0.178cm}}
\pgfpathcurveto{\pgfqpoint{1.383cm}{0.214cm}}{\pgfqpoint{1.369cm}{0.249cm}}{\pgfqpoint{1.343cm}{0.275cm}}
\pgfpathcurveto{\pgfqpoint{1.317cm}{0.3cm}}{\pgfqpoint{1.283cm}{0.315cm}}{\pgfqpoint{1.246cm}{0.315cm}}
\pgfpathcurveto{\pgfqpoint{1.21cm}{0.315cm}}{\pgfqpoint{1.175cm}{0.3cm}}{\pgfqpoint{1.15cm}{0.275cm}}
\pgfpathcurveto{\pgfqpoint{1.124cm}{0.249cm}}{\pgfqpoint{1.11cm}{0.214cm}}{\pgfqpoint{1.11cm}{0.178cm}}
\pgfpathcurveto{\pgfqpoint{1.11cm}{0.141cm}}{\pgfqpoint{1.124cm}{0.107cm}}{\pgfqpoint{1.15cm}{0.081cm}}
\pgfpathcurveto{\pgfqpoint{1.175cm}{0.055cm}}{\pgfqpoint{1.21cm}{0.041cm}}{\pgfqpoint{1.246cm}{0.041cm}}
\pgfpathcurveto{\pgfqpoint{1.283cm}{0.041cm}}{\pgfqpoint{1.317cm}{0.055cm}}{\pgfqpoint{1.343cm}{0.081cm}}
\pgfpathcurveto{\pgfqpoint{1.369cm}{0.107cm}}{\pgfqpoint{1.383cm}{0.141cm}}{\pgfqpoint{1.383cm}{0.178cm}}
\pgfusepath{stroke}
\end{pgfscope}
\end{pgfscope}
\end{pgfscope}
\end{pgfscope}
\end{tikzpicture}}}}+\rmb{6(\phi+\psi)\prec\UU_{\leqslant}X^{\!\resizebox{!}{.8em}{
\begin{tikzpicture}
\pgfpathmoveto{\pgfqpoint{0cm}{-0.035cm}}
\pgfpathlineto{\pgfqpoint{1.976cm}{-0.035cm}}
\pgfpathlineto{\pgfqpoint{1.976cm}{1.94cm}}
\pgfpathlineto{\pgfqpoint{0cm}{1.94cm}}
\pgfpathclose
\pgfusepath{clip}
\begin{pgfscope}
\begin{pgfscope}
\pgfpathmoveto{\pgfqpoint{0cm}{-0.035cm}}
\pgfpathlineto{\pgfqpoint{1.976cm}{-0.035cm}}
\pgfpathlineto{\pgfqpoint{1.976cm}{1.94cm}}
\pgfpathlineto{\pgfqpoint{0cm}{1.94cm}}
\pgfpathclose
\pgfusepath{clip}
\begin{pgfscope}
\begin{pgfscope}
\pgfsetdash{}{0cm}
\pgfsetlinewidth{0.818mm}
\pgfsetroundcap
\pgfsetroundjoin
\pgfsetmiterlimit{7.0}
\definecolor{eps2pgf_color}{gray}{0}\pgfsetstrokecolor{eps2pgf_color}\pgfsetfillcolor{eps2pgf_color}
\pgfpathmoveto{\pgfqpoint{0.117cm}{1.815cm}}
\pgfpathlineto{\pgfqpoint{0.682cm}{1.065cm}}
\pgfpathlineto{\pgfqpoint{1.246cm}{1.815cm}}
\pgfusepath{stroke}
\end{pgfscope}
\definecolor{eps2pgf_color}{gray}{0}\pgfsetstrokecolor{eps2pgf_color}\pgfsetfillcolor{eps2pgf_color}
\pgfpathmoveto{\pgfqpoint{0.273cm}{1.789cm}}
\pgfpathcurveto{\pgfqpoint{0.273cm}{1.825cm}}{\pgfqpoint{0.259cm}{1.86cm}}{\pgfqpoint{0.233cm}{1.886cm}}
\pgfpathcurveto{\pgfqpoint{0.207cm}{1.912cm}}{\pgfqpoint{0.173cm}{1.926cm}}{\pgfqpoint{0.137cm}{1.926cm}}
\pgfpathcurveto{\pgfqpoint{0.1cm}{1.926cm}}{\pgfqpoint{0.066cm}{1.912cm}}{\pgfqpoint{0.04cm}{1.886cm}}
\pgfpathcurveto{\pgfqpoint{0.014cm}{1.86cm}}{\pgfqpoint{0cm}{1.825cm}}{\pgfqpoint{0cm}{1.789cm}}
\pgfpathcurveto{\pgfqpoint{0cm}{1.753cm}}{\pgfqpoint{0.014cm}{1.718cm}}{\pgfqpoint{0.04cm}{1.692cm}}
\pgfpathcurveto{\pgfqpoint{0.066cm}{1.667cm}}{\pgfqpoint{0.1cm}{1.652cm}}{\pgfqpoint{0.137cm}{1.652cm}}
\pgfpathcurveto{\pgfqpoint{0.173cm}{1.652cm}}{\pgfqpoint{0.207cm}{1.667cm}}{\pgfqpoint{0.233cm}{1.692cm}}
\pgfpathcurveto{\pgfqpoint{0.259cm}{1.718cm}}{\pgfqpoint{0.273cm}{1.753cm}}{\pgfqpoint{0.273cm}{1.789cm}}
\pgfusepath{fill}
\begin{pgfscope}
\pgfsetdash{}{0cm}
\pgfsetlinewidth{0.818mm}
\pgfsetmiterlimit{7.0}
\pgfpathmoveto{\pgfqpoint{0.682cm}{1.065cm}}
\pgfpathlineto{\pgfqpoint{0.679cm}{1.812cm}}
\pgfusepath{stroke}
\end{pgfscope}
\pgfpathmoveto{\pgfqpoint{0.815cm}{1.793cm}}
\pgfpathcurveto{\pgfqpoint{0.815cm}{1.829cm}}{\pgfqpoint{0.801cm}{1.864cm}}{\pgfqpoint{0.775cm}{1.89cm}}
\pgfpathcurveto{\pgfqpoint{0.75cm}{1.915cm}}{\pgfqpoint{0.715cm}{1.93cm}}{\pgfqpoint{0.679cm}{1.93cm}}
\pgfpathcurveto{\pgfqpoint{0.643cm}{1.93cm}}{\pgfqpoint{0.608cm}{1.915cm}}{\pgfqpoint{0.582cm}{1.89cm}}
\pgfpathcurveto{\pgfqpoint{0.557cm}{1.864cm}}{\pgfqpoint{0.542cm}{1.829cm}}{\pgfqpoint{0.542cm}{1.793cm}}
\pgfpathcurveto{\pgfqpoint{0.542cm}{1.756cm}}{\pgfqpoint{0.557cm}{1.722cm}}{\pgfqpoint{0.582cm}{1.696cm}}
\pgfpathcurveto{\pgfqpoint{0.608cm}{1.67cm}}{\pgfqpoint{0.643cm}{1.656cm}}{\pgfqpoint{0.679cm}{1.656cm}}
\pgfpathcurveto{\pgfqpoint{0.715cm}{1.656cm}}{\pgfqpoint{0.75cm}{1.67cm}}{\pgfqpoint{0.775cm}{1.696cm}}
\pgfpathcurveto{\pgfqpoint{0.801cm}{1.722cm}}{\pgfqpoint{0.815cm}{1.756cm}}{\pgfqpoint{0.815cm}{1.793cm}}
\pgfusepath{fill}
\pgfpathmoveto{\pgfqpoint{1.345cm}{1.765cm}}
\pgfpathcurveto{\pgfqpoint{1.345cm}{1.801cm}}{\pgfqpoint{1.331cm}{1.836cm}}{\pgfqpoint{1.305cm}{1.862cm}}
\pgfpathcurveto{\pgfqpoint{1.28cm}{1.887cm}}{\pgfqpoint{1.245cm}{1.902cm}}{\pgfqpoint{1.209cm}{1.902cm}}
\pgfpathcurveto{\pgfqpoint{1.172cm}{1.902cm}}{\pgfqpoint{1.138cm}{1.887cm}}{\pgfqpoint{1.112cm}{1.862cm}}
\pgfpathcurveto{\pgfqpoint{1.087cm}{1.836cm}}{\pgfqpoint{1.072cm}{1.801cm}}{\pgfqpoint{1.072cm}{1.765cm}}
\pgfpathcurveto{\pgfqpoint{1.072cm}{1.728cm}}{\pgfqpoint{1.087cm}{1.694cm}}{\pgfqpoint{1.112cm}{1.668cm}}
\pgfpathcurveto{\pgfqpoint{1.138cm}{1.642cm}}{\pgfqpoint{1.172cm}{1.628cm}}{\pgfqpoint{1.209cm}{1.628cm}}
\pgfpathcurveto{\pgfqpoint{1.245cm}{1.628cm}}{\pgfqpoint{1.28cm}{1.642cm}}{\pgfqpoint{1.305cm}{1.668cm}}
\pgfpathcurveto{\pgfqpoint{1.331cm}{1.694cm}}{\pgfqpoint{1.345cm}{1.728cm}}{\pgfqpoint{1.345cm}{1.765cm}}
\pgfusepath{fill}
\begin{pgfscope}
\pgfsetdash{}{0cm}
\pgfsetlinewidth{0.818mm}
\pgfsetroundcap
\pgfsetroundjoin
\pgfsetmiterlimit{7.0}
\pgfpathmoveto{\pgfqpoint{0.682cm}{1.065cm}}
\pgfpathlineto{\pgfqpoint{1.246cm}{0.315cm}}
\pgfpathlineto{\pgfqpoint{1.811cm}{1.065cm}}
\pgfusepath{stroke}
\end{pgfscope}
\pgfpathmoveto{\pgfqpoint{1.948cm}{1.065cm}}
\pgfpathcurveto{\pgfqpoint{1.948cm}{1.101cm}}{\pgfqpoint{1.933cm}{1.136cm}}{\pgfqpoint{1.907cm}{1.162cm}}
\pgfpathcurveto{\pgfqpoint{1.882cm}{1.187cm}}{\pgfqpoint{1.847cm}{1.202cm}}{\pgfqpoint{1.811cm}{1.202cm}}
\pgfpathcurveto{\pgfqpoint{1.775cm}{1.202cm}}{\pgfqpoint{1.74cm}{1.187cm}}{\pgfqpoint{1.714cm}{1.162cm}}
\pgfpathcurveto{\pgfqpoint{1.689cm}{1.136cm}}{\pgfqpoint{1.674cm}{1.101cm}}{\pgfqpoint{1.674cm}{1.065cm}}
\pgfpathcurveto{\pgfqpoint{1.674cm}{1.029cm}}{\pgfqpoint{1.689cm}{0.994cm}}{\pgfqpoint{1.714cm}{0.968cm}}
\pgfpathcurveto{\pgfqpoint{1.74cm}{0.942cm}}{\pgfqpoint{1.775cm}{0.928cm}}{\pgfqpoint{1.811cm}{0.928cm}}
\pgfpathcurveto{\pgfqpoint{1.847cm}{0.928cm}}{\pgfqpoint{1.882cm}{0.942cm}}{\pgfqpoint{1.907cm}{0.968cm}}
\pgfpathcurveto{\pgfqpoint{1.933cm}{0.994cm}}{\pgfqpoint{1.948cm}{1.029cm}}{\pgfqpoint{1.948cm}{1.065cm}}
\pgfusepath{fill}
\begin{pgfscope}
\pgfsetdash{}{0cm}
\pgfsetlinewidth{0.818mm}
\pgfsetmiterlimit{4.0}
\pgfpathmoveto{\pgfqpoint{1.383cm}{0.178cm}}
\pgfpathcurveto{\pgfqpoint{1.383cm}{0.214cm}}{\pgfqpoint{1.369cm}{0.249cm}}{\pgfqpoint{1.343cm}{0.275cm}}
\pgfpathcurveto{\pgfqpoint{1.317cm}{0.3cm}}{\pgfqpoint{1.283cm}{0.315cm}}{\pgfqpoint{1.246cm}{0.315cm}}
\pgfpathcurveto{\pgfqpoint{1.21cm}{0.315cm}}{\pgfqpoint{1.175cm}{0.3cm}}{\pgfqpoint{1.15cm}{0.275cm}}
\pgfpathcurveto{\pgfqpoint{1.124cm}{0.249cm}}{\pgfqpoint{1.11cm}{0.214cm}}{\pgfqpoint{1.11cm}{0.178cm}}
\pgfpathcurveto{\pgfqpoint{1.11cm}{0.141cm}}{\pgfqpoint{1.124cm}{0.107cm}}{\pgfqpoint{1.15cm}{0.081cm}}
\pgfpathcurveto{\pgfqpoint{1.175cm}{0.055cm}}{\pgfqpoint{1.21cm}{0.041cm}}{\pgfqpoint{1.246cm}{0.041cm}}
\pgfpathcurveto{\pgfqpoint{1.283cm}{0.041cm}}{\pgfqpoint{1.317cm}{0.055cm}}{\pgfqpoint{1.343cm}{0.081cm}}
\pgfpathcurveto{\pgfqpoint{1.369cm}{0.107cm}}{\pgfqpoint{1.383cm}{0.141cm}}{\pgfqpoint{1.383cm}{0.178cm}}
\pgfusepath{stroke}
\end{pgfscope}
\end{pgfscope}
\end{pgfscope}
\end{pgfscope}
\end{tikzpicture}}}}+\rmb{6(\phi+\psi)\succcurlyeqX^{\!\resizebox{!}{.8em}{
\begin{tikzpicture}
\pgfpathmoveto{\pgfqpoint{0cm}{-0.035cm}}
\pgfpathlineto{\pgfqpoint{1.976cm}{-0.035cm}}
\pgfpathlineto{\pgfqpoint{1.976cm}{1.94cm}}
\pgfpathlineto{\pgfqpoint{0cm}{1.94cm}}
\pgfpathclose
\pgfusepath{clip}
\begin{pgfscope}
\begin{pgfscope}
\pgfpathmoveto{\pgfqpoint{0cm}{-0.035cm}}
\pgfpathlineto{\pgfqpoint{1.976cm}{-0.035cm}}
\pgfpathlineto{\pgfqpoint{1.976cm}{1.94cm}}
\pgfpathlineto{\pgfqpoint{0cm}{1.94cm}}
\pgfpathclose
\pgfusepath{clip}
\begin{pgfscope}
\begin{pgfscope}
\pgfsetdash{}{0cm}
\pgfsetlinewidth{0.818mm}
\pgfsetroundcap
\pgfsetroundjoin
\pgfsetmiterlimit{7.0}
\definecolor{eps2pgf_color}{gray}{0}\pgfsetstrokecolor{eps2pgf_color}\pgfsetfillcolor{eps2pgf_color}
\pgfpathmoveto{\pgfqpoint{0.117cm}{1.815cm}}
\pgfpathlineto{\pgfqpoint{0.682cm}{1.065cm}}
\pgfpathlineto{\pgfqpoint{1.246cm}{1.815cm}}
\pgfusepath{stroke}
\end{pgfscope}
\definecolor{eps2pgf_color}{gray}{0}\pgfsetstrokecolor{eps2pgf_color}\pgfsetfillcolor{eps2pgf_color}
\pgfpathmoveto{\pgfqpoint{0.273cm}{1.789cm}}
\pgfpathcurveto{\pgfqpoint{0.273cm}{1.825cm}}{\pgfqpoint{0.259cm}{1.86cm}}{\pgfqpoint{0.233cm}{1.886cm}}
\pgfpathcurveto{\pgfqpoint{0.207cm}{1.912cm}}{\pgfqpoint{0.173cm}{1.926cm}}{\pgfqpoint{0.137cm}{1.926cm}}
\pgfpathcurveto{\pgfqpoint{0.1cm}{1.926cm}}{\pgfqpoint{0.066cm}{1.912cm}}{\pgfqpoint{0.04cm}{1.886cm}}
\pgfpathcurveto{\pgfqpoint{0.014cm}{1.86cm}}{\pgfqpoint{0cm}{1.825cm}}{\pgfqpoint{0cm}{1.789cm}}
\pgfpathcurveto{\pgfqpoint{0cm}{1.753cm}}{\pgfqpoint{0.014cm}{1.718cm}}{\pgfqpoint{0.04cm}{1.692cm}}
\pgfpathcurveto{\pgfqpoint{0.066cm}{1.667cm}}{\pgfqpoint{0.1cm}{1.652cm}}{\pgfqpoint{0.137cm}{1.652cm}}
\pgfpathcurveto{\pgfqpoint{0.173cm}{1.652cm}}{\pgfqpoint{0.207cm}{1.667cm}}{\pgfqpoint{0.233cm}{1.692cm}}
\pgfpathcurveto{\pgfqpoint{0.259cm}{1.718cm}}{\pgfqpoint{0.273cm}{1.753cm}}{\pgfqpoint{0.273cm}{1.789cm}}
\pgfusepath{fill}
\begin{pgfscope}
\pgfsetdash{}{0cm}
\pgfsetlinewidth{0.818mm}
\pgfsetmiterlimit{7.0}
\pgfpathmoveto{\pgfqpoint{0.682cm}{1.065cm}}
\pgfpathlineto{\pgfqpoint{0.679cm}{1.812cm}}
\pgfusepath{stroke}
\end{pgfscope}
\pgfpathmoveto{\pgfqpoint{0.815cm}{1.793cm}}
\pgfpathcurveto{\pgfqpoint{0.815cm}{1.829cm}}{\pgfqpoint{0.801cm}{1.864cm}}{\pgfqpoint{0.775cm}{1.89cm}}
\pgfpathcurveto{\pgfqpoint{0.75cm}{1.915cm}}{\pgfqpoint{0.715cm}{1.93cm}}{\pgfqpoint{0.679cm}{1.93cm}}
\pgfpathcurveto{\pgfqpoint{0.643cm}{1.93cm}}{\pgfqpoint{0.608cm}{1.915cm}}{\pgfqpoint{0.582cm}{1.89cm}}
\pgfpathcurveto{\pgfqpoint{0.557cm}{1.864cm}}{\pgfqpoint{0.542cm}{1.829cm}}{\pgfqpoint{0.542cm}{1.793cm}}
\pgfpathcurveto{\pgfqpoint{0.542cm}{1.756cm}}{\pgfqpoint{0.557cm}{1.722cm}}{\pgfqpoint{0.582cm}{1.696cm}}
\pgfpathcurveto{\pgfqpoint{0.608cm}{1.67cm}}{\pgfqpoint{0.643cm}{1.656cm}}{\pgfqpoint{0.679cm}{1.656cm}}
\pgfpathcurveto{\pgfqpoint{0.715cm}{1.656cm}}{\pgfqpoint{0.75cm}{1.67cm}}{\pgfqpoint{0.775cm}{1.696cm}}
\pgfpathcurveto{\pgfqpoint{0.801cm}{1.722cm}}{\pgfqpoint{0.815cm}{1.756cm}}{\pgfqpoint{0.815cm}{1.793cm}}
\pgfusepath{fill}
\pgfpathmoveto{\pgfqpoint{1.345cm}{1.765cm}}
\pgfpathcurveto{\pgfqpoint{1.345cm}{1.801cm}}{\pgfqpoint{1.331cm}{1.836cm}}{\pgfqpoint{1.305cm}{1.862cm}}
\pgfpathcurveto{\pgfqpoint{1.28cm}{1.887cm}}{\pgfqpoint{1.245cm}{1.902cm}}{\pgfqpoint{1.209cm}{1.902cm}}
\pgfpathcurveto{\pgfqpoint{1.172cm}{1.902cm}}{\pgfqpoint{1.138cm}{1.887cm}}{\pgfqpoint{1.112cm}{1.862cm}}
\pgfpathcurveto{\pgfqpoint{1.087cm}{1.836cm}}{\pgfqpoint{1.072cm}{1.801cm}}{\pgfqpoint{1.072cm}{1.765cm}}
\pgfpathcurveto{\pgfqpoint{1.072cm}{1.728cm}}{\pgfqpoint{1.087cm}{1.694cm}}{\pgfqpoint{1.112cm}{1.668cm}}
\pgfpathcurveto{\pgfqpoint{1.138cm}{1.642cm}}{\pgfqpoint{1.172cm}{1.628cm}}{\pgfqpoint{1.209cm}{1.628cm}}
\pgfpathcurveto{\pgfqpoint{1.245cm}{1.628cm}}{\pgfqpoint{1.28cm}{1.642cm}}{\pgfqpoint{1.305cm}{1.668cm}}
\pgfpathcurveto{\pgfqpoint{1.331cm}{1.694cm}}{\pgfqpoint{1.345cm}{1.728cm}}{\pgfqpoint{1.345cm}{1.765cm}}
\pgfusepath{fill}
\begin{pgfscope}
\pgfsetdash{}{0cm}
\pgfsetlinewidth{0.818mm}
\pgfsetroundcap
\pgfsetroundjoin
\pgfsetmiterlimit{7.0}
\pgfpathmoveto{\pgfqpoint{0.682cm}{1.065cm}}
\pgfpathlineto{\pgfqpoint{1.246cm}{0.315cm}}
\pgfpathlineto{\pgfqpoint{1.811cm}{1.065cm}}
\pgfusepath{stroke}
\end{pgfscope}
\pgfpathmoveto{\pgfqpoint{1.948cm}{1.065cm}}
\pgfpathcurveto{\pgfqpoint{1.948cm}{1.101cm}}{\pgfqpoint{1.933cm}{1.136cm}}{\pgfqpoint{1.907cm}{1.162cm}}
\pgfpathcurveto{\pgfqpoint{1.882cm}{1.187cm}}{\pgfqpoint{1.847cm}{1.202cm}}{\pgfqpoint{1.811cm}{1.202cm}}
\pgfpathcurveto{\pgfqpoint{1.775cm}{1.202cm}}{\pgfqpoint{1.74cm}{1.187cm}}{\pgfqpoint{1.714cm}{1.162cm}}
\pgfpathcurveto{\pgfqpoint{1.689cm}{1.136cm}}{\pgfqpoint{1.674cm}{1.101cm}}{\pgfqpoint{1.674cm}{1.065cm}}
\pgfpathcurveto{\pgfqpoint{1.674cm}{1.029cm}}{\pgfqpoint{1.689cm}{0.994cm}}{\pgfqpoint{1.714cm}{0.968cm}}
\pgfpathcurveto{\pgfqpoint{1.74cm}{0.942cm}}{\pgfqpoint{1.775cm}{0.928cm}}{\pgfqpoint{1.811cm}{0.928cm}}
\pgfpathcurveto{\pgfqpoint{1.847cm}{0.928cm}}{\pgfqpoint{1.882cm}{0.942cm}}{\pgfqpoint{1.907cm}{0.968cm}}
\pgfpathcurveto{\pgfqpoint{1.933cm}{0.994cm}}{\pgfqpoint{1.948cm}{1.029cm}}{\pgfqpoint{1.948cm}{1.065cm}}
\pgfusepath{fill}
\begin{pgfscope}
\pgfsetdash{}{0cm}
\pgfsetlinewidth{0.818mm}
\pgfsetmiterlimit{4.0}
\pgfpathmoveto{\pgfqpoint{1.383cm}{0.178cm}}
\pgfpathcurveto{\pgfqpoint{1.383cm}{0.214cm}}{\pgfqpoint{1.369cm}{0.249cm}}{\pgfqpoint{1.343cm}{0.275cm}}
\pgfpathcurveto{\pgfqpoint{1.317cm}{0.3cm}}{\pgfqpoint{1.283cm}{0.315cm}}{\pgfqpoint{1.246cm}{0.315cm}}
\pgfpathcurveto{\pgfqpoint{1.21cm}{0.315cm}}{\pgfqpoint{1.175cm}{0.3cm}}{\pgfqpoint{1.15cm}{0.275cm}}
\pgfpathcurveto{\pgfqpoint{1.124cm}{0.249cm}}{\pgfqpoint{1.11cm}{0.214cm}}{\pgfqpoint{1.11cm}{0.178cm}}
\pgfpathcurveto{\pgfqpoint{1.11cm}{0.141cm}}{\pgfqpoint{1.124cm}{0.107cm}}{\pgfqpoint{1.15cm}{0.081cm}}
\pgfpathcurveto{\pgfqpoint{1.175cm}{0.055cm}}{\pgfqpoint{1.21cm}{0.041cm}}{\pgfqpoint{1.246cm}{0.041cm}}
\pgfpathcurveto{\pgfqpoint{1.283cm}{0.041cm}}{\pgfqpoint{1.317cm}{0.055cm}}{\pgfqpoint{1.343cm}{0.081cm}}
\pgfpathcurveto{\pgfqpoint{1.369cm}{0.107cm}}{\pgfqpoint{1.383cm}{0.141cm}}{\pgfqpoint{1.383cm}{0.178cm}}
\pgfusepath{stroke}
\end{pgfscope}
\end{pgfscope}
\end{pgfscope}
\end{pgfscope}
\end{tikzpicture}}}},
\end{align*}
\begin{align}\label{eq:91}
\rmg{3X(\phi+\psi)^2}=\rmm{3\UU_> X\succ(\phi+\psi)^2}+\rmb{3\UU_\leqslant X\succ (\phi+\psi)^2}+\rmb{3 X\preccurlyeq(\phi+\psi)^2}.
\end{align}

As mentioned above,  the concrete choice of the localizers $\UU_\leq$, $\UU_{>}$ in the above changes from line to line.  In particular, it will be seen below that the localization of $X$ in \eqref{eq:92}, \eqref{eq:93} is different from \eqref{eq:91}. The precise choice of these parameters will be made in Section \ref{ssec:phi1} below.

Now, let $\rmm{\Phi}$ be the sum of all the  magenta terms above and $\rmb{\Psi}$ the sum of all the blue terms. \rmbb{More precisely,} 
\begin{align*}
\rmm{\Phi}&:=-\rmm{3\llbracket X^2 \rrbracket\succX^{\!\resizebox{0.6em}{!}{
\begin{tikzpicture}
\pgfpathmoveto{\pgfqpoint{0cm}{-0.035cm}}
\pgfpathlineto{\pgfqpoint{1.376cm}{-0.035cm}}
\pgfpathlineto{\pgfqpoint{1.376cm}{1.552cm}}
\pgfpathlineto{\pgfqpoint{0cm}{1.552cm}}
\pgfpathclose
\pgfusepath{clip}
\begin{pgfscope}
\begin{pgfscope}
\pgfpathmoveto{\pgfqpoint{0cm}{-0.035cm}}
\pgfpathlineto{\pgfqpoint{1.376cm}{-0.035cm}}
\pgfpathlineto{\pgfqpoint{1.376cm}{1.552cm}}
\pgfpathlineto{\pgfqpoint{0cm}{1.552cm}}
\pgfpathclose
\pgfusepath{clip}
\begin{pgfscope}
\begin{pgfscope}
\pgfsetdash{}{0cm}
\pgfsetlinewidth{0.818mm}
\pgfsetroundcap
\pgfsetroundjoin
\pgfsetmiterlimit{7.0}
\definecolor{eps2pgf_color}{gray}{0}\pgfsetstrokecolor{eps2pgf_color}\pgfsetfillcolor{eps2pgf_color}
\pgfpathmoveto{\pgfqpoint{0.117cm}{1.421cm}}
\pgfpathlineto{\pgfqpoint{0.682cm}{0.671cm}}
\pgfpathlineto{\pgfqpoint{1.246cm}{1.421cm}}
\pgfusepath{stroke}
\end{pgfscope}
\definecolor{eps2pgf_color}{gray}{0}\pgfsetstrokecolor{eps2pgf_color}\pgfsetfillcolor{eps2pgf_color}
\pgfpathmoveto{\pgfqpoint{0.273cm}{1.395cm}}
\pgfpathcurveto{\pgfqpoint{0.273cm}{1.432cm}}{\pgfqpoint{0.259cm}{1.467cm}}{\pgfqpoint{0.233cm}{1.492cm}}
\pgfpathcurveto{\pgfqpoint{0.207cm}{1.518cm}}{\pgfqpoint{0.173cm}{1.532cm}}{\pgfqpoint{0.137cm}{1.532cm}}
\pgfpathcurveto{\pgfqpoint{0.1cm}{1.532cm}}{\pgfqpoint{0.066cm}{1.518cm}}{\pgfqpoint{0.04cm}{1.492cm}}
\pgfpathcurveto{\pgfqpoint{0.014cm}{1.467cm}}{\pgfqpoint{0cm}{1.432cm}}{\pgfqpoint{0cm}{1.395cm}}
\pgfpathcurveto{\pgfqpoint{0cm}{1.359cm}}{\pgfqpoint{0.014cm}{1.324cm}}{\pgfqpoint{0.04cm}{1.299cm}}
\pgfpathcurveto{\pgfqpoint{0.066cm}{1.273cm}}{\pgfqpoint{0.1cm}{1.258cm}}{\pgfqpoint{0.137cm}{1.258cm}}
\pgfpathcurveto{\pgfqpoint{0.173cm}{1.258cm}}{\pgfqpoint{0.207cm}{1.273cm}}{\pgfqpoint{0.233cm}{1.299cm}}
\pgfpathcurveto{\pgfqpoint{0.259cm}{1.324cm}}{\pgfqpoint{0.273cm}{1.359cm}}{\pgfqpoint{0.273cm}{1.395cm}}
\pgfusepath{fill}
\begin{pgfscope}
\pgfsetdash{}{0cm}
\pgfsetlinewidth{0.818mm}
\pgfsetmiterlimit{7.0}
\pgfpathmoveto{\pgfqpoint{0.682cm}{0.671cm}}
\pgfpathlineto{\pgfqpoint{0.679cm}{1.418cm}}
\pgfusepath{stroke}
\end{pgfscope}
\pgfpathmoveto{\pgfqpoint{0.815cm}{1.399cm}}
\pgfpathcurveto{\pgfqpoint{0.815cm}{1.435cm}}{\pgfqpoint{0.801cm}{1.47cm}}{\pgfqpoint{0.775cm}{1.496cm}}
\pgfpathcurveto{\pgfqpoint{0.75cm}{1.521cm}}{\pgfqpoint{0.715cm}{1.536cm}}{\pgfqpoint{0.679cm}{1.536cm}}
\pgfpathcurveto{\pgfqpoint{0.643cm}{1.536cm}}{\pgfqpoint{0.608cm}{1.521cm}}{\pgfqpoint{0.582cm}{1.496cm}}
\pgfpathcurveto{\pgfqpoint{0.557cm}{1.47cm}}{\pgfqpoint{0.542cm}{1.435cm}}{\pgfqpoint{0.542cm}{1.399cm}}
\pgfpathcurveto{\pgfqpoint{0.542cm}{1.363cm}}{\pgfqpoint{0.557cm}{1.328cm}}{\pgfqpoint{0.582cm}{1.302cm}}
\pgfpathcurveto{\pgfqpoint{0.608cm}{1.276cm}}{\pgfqpoint{0.643cm}{1.262cm}}{\pgfqpoint{0.679cm}{1.262cm}}
\pgfpathcurveto{\pgfqpoint{0.715cm}{1.262cm}}{\pgfqpoint{0.75cm}{1.276cm}}{\pgfqpoint{0.775cm}{1.302cm}}
\pgfpathcurveto{\pgfqpoint{0.801cm}{1.328cm}}{\pgfqpoint{0.815cm}{1.363cm}}{\pgfqpoint{0.815cm}{1.399cm}}
\pgfusepath{fill}
\pgfpathmoveto{\pgfqpoint{1.345cm}{1.371cm}}
\pgfpathcurveto{\pgfqpoint{1.345cm}{1.408cm}}{\pgfqpoint{1.331cm}{1.442cm}}{\pgfqpoint{1.305cm}{1.468cm}}
\pgfpathcurveto{\pgfqpoint{1.28cm}{1.494cm}}{\pgfqpoint{1.245cm}{1.508cm}}{\pgfqpoint{1.209cm}{1.508cm}}
\pgfpathcurveto{\pgfqpoint{1.172cm}{1.508cm}}{\pgfqpoint{1.138cm}{1.494cm}}{\pgfqpoint{1.112cm}{1.468cm}}
\pgfpathcurveto{\pgfqpoint{1.087cm}{1.442cm}}{\pgfqpoint{1.072cm}{1.408cm}}{\pgfqpoint{1.072cm}{1.371cm}}
\pgfpathcurveto{\pgfqpoint{1.072cm}{1.335cm}}{\pgfqpoint{1.087cm}{1.3cm}}{\pgfqpoint{1.112cm}{1.274cm}}
\pgfpathcurveto{\pgfqpoint{1.138cm}{1.249cm}}{\pgfqpoint{1.172cm}{1.234cm}}{\pgfqpoint{1.209cm}{1.234cm}}
\pgfpathcurveto{\pgfqpoint{1.245cm}{1.234cm}}{\pgfqpoint{1.28cm}{1.249cm}}{\pgfqpoint{1.305cm}{1.274cm}}
\pgfpathcurveto{\pgfqpoint{1.331cm}{1.3cm}}{\pgfqpoint{1.345cm}{1.335cm}}{\pgfqpoint{1.345cm}{1.371cm}}
\pgfusepath{fill}
\begin{pgfscope}
\pgfsetdash{}{0cm}
\pgfsetlinewidth{0.818mm}
\pgfsetroundcap
\pgfsetmiterlimit{4.0}
\pgfpathmoveto{\pgfqpoint{0.682cm}{0.671cm}}
\pgfpathlineto{\pgfqpoint{0.682cm}{0.042cm}}
\pgfusepath{stroke}
\end{pgfscope}
\end{pgfscope}
\end{pgfscope}
\end{pgfscope}
\end{tikzpicture}}}}-\rmm{3\llbracket X^2 \rrbracket\precX^{\!\resizebox{0.6em}{!}{
\begin{tikzpicture}
\pgfpathmoveto{\pgfqpoint{0cm}{-0.035cm}}
\pgfpathlineto{\pgfqpoint{1.376cm}{-0.035cm}}
\pgfpathlineto{\pgfqpoint{1.376cm}{1.552cm}}
\pgfpathlineto{\pgfqpoint{0cm}{1.552cm}}
\pgfpathclose
\pgfusepath{clip}
\begin{pgfscope}
\begin{pgfscope}
\pgfpathmoveto{\pgfqpoint{0cm}{-0.035cm}}
\pgfpathlineto{\pgfqpoint{1.376cm}{-0.035cm}}
\pgfpathlineto{\pgfqpoint{1.376cm}{1.552cm}}
\pgfpathlineto{\pgfqpoint{0cm}{1.552cm}}
\pgfpathclose
\pgfusepath{clip}
\begin{pgfscope}
\begin{pgfscope}
\pgfsetdash{}{0cm}
\pgfsetlinewidth{0.818mm}
\pgfsetroundcap
\pgfsetroundjoin
\pgfsetmiterlimit{7.0}
\definecolor{eps2pgf_color}{gray}{0}\pgfsetstrokecolor{eps2pgf_color}\pgfsetfillcolor{eps2pgf_color}
\pgfpathmoveto{\pgfqpoint{0.117cm}{1.421cm}}
\pgfpathlineto{\pgfqpoint{0.682cm}{0.671cm}}
\pgfpathlineto{\pgfqpoint{1.246cm}{1.421cm}}
\pgfusepath{stroke}
\end{pgfscope}
\definecolor{eps2pgf_color}{gray}{0}\pgfsetstrokecolor{eps2pgf_color}\pgfsetfillcolor{eps2pgf_color}
\pgfpathmoveto{\pgfqpoint{0.273cm}{1.395cm}}
\pgfpathcurveto{\pgfqpoint{0.273cm}{1.432cm}}{\pgfqpoint{0.259cm}{1.467cm}}{\pgfqpoint{0.233cm}{1.492cm}}
\pgfpathcurveto{\pgfqpoint{0.207cm}{1.518cm}}{\pgfqpoint{0.173cm}{1.532cm}}{\pgfqpoint{0.137cm}{1.532cm}}
\pgfpathcurveto{\pgfqpoint{0.1cm}{1.532cm}}{\pgfqpoint{0.066cm}{1.518cm}}{\pgfqpoint{0.04cm}{1.492cm}}
\pgfpathcurveto{\pgfqpoint{0.014cm}{1.467cm}}{\pgfqpoint{0cm}{1.432cm}}{\pgfqpoint{0cm}{1.395cm}}
\pgfpathcurveto{\pgfqpoint{0cm}{1.359cm}}{\pgfqpoint{0.014cm}{1.324cm}}{\pgfqpoint{0.04cm}{1.299cm}}
\pgfpathcurveto{\pgfqpoint{0.066cm}{1.273cm}}{\pgfqpoint{0.1cm}{1.258cm}}{\pgfqpoint{0.137cm}{1.258cm}}
\pgfpathcurveto{\pgfqpoint{0.173cm}{1.258cm}}{\pgfqpoint{0.207cm}{1.273cm}}{\pgfqpoint{0.233cm}{1.299cm}}
\pgfpathcurveto{\pgfqpoint{0.259cm}{1.324cm}}{\pgfqpoint{0.273cm}{1.359cm}}{\pgfqpoint{0.273cm}{1.395cm}}
\pgfusepath{fill}
\begin{pgfscope}
\pgfsetdash{}{0cm}
\pgfsetlinewidth{0.818mm}
\pgfsetmiterlimit{7.0}
\pgfpathmoveto{\pgfqpoint{0.682cm}{0.671cm}}
\pgfpathlineto{\pgfqpoint{0.679cm}{1.418cm}}
\pgfusepath{stroke}
\end{pgfscope}
\pgfpathmoveto{\pgfqpoint{0.815cm}{1.399cm}}
\pgfpathcurveto{\pgfqpoint{0.815cm}{1.435cm}}{\pgfqpoint{0.801cm}{1.47cm}}{\pgfqpoint{0.775cm}{1.496cm}}
\pgfpathcurveto{\pgfqpoint{0.75cm}{1.521cm}}{\pgfqpoint{0.715cm}{1.536cm}}{\pgfqpoint{0.679cm}{1.536cm}}
\pgfpathcurveto{\pgfqpoint{0.643cm}{1.536cm}}{\pgfqpoint{0.608cm}{1.521cm}}{\pgfqpoint{0.582cm}{1.496cm}}
\pgfpathcurveto{\pgfqpoint{0.557cm}{1.47cm}}{\pgfqpoint{0.542cm}{1.435cm}}{\pgfqpoint{0.542cm}{1.399cm}}
\pgfpathcurveto{\pgfqpoint{0.542cm}{1.363cm}}{\pgfqpoint{0.557cm}{1.328cm}}{\pgfqpoint{0.582cm}{1.302cm}}
\pgfpathcurveto{\pgfqpoint{0.608cm}{1.276cm}}{\pgfqpoint{0.643cm}{1.262cm}}{\pgfqpoint{0.679cm}{1.262cm}}
\pgfpathcurveto{\pgfqpoint{0.715cm}{1.262cm}}{\pgfqpoint{0.75cm}{1.276cm}}{\pgfqpoint{0.775cm}{1.302cm}}
\pgfpathcurveto{\pgfqpoint{0.801cm}{1.328cm}}{\pgfqpoint{0.815cm}{1.363cm}}{\pgfqpoint{0.815cm}{1.399cm}}
\pgfusepath{fill}
\pgfpathmoveto{\pgfqpoint{1.345cm}{1.371cm}}
\pgfpathcurveto{\pgfqpoint{1.345cm}{1.408cm}}{\pgfqpoint{1.331cm}{1.442cm}}{\pgfqpoint{1.305cm}{1.468cm}}
\pgfpathcurveto{\pgfqpoint{1.28cm}{1.494cm}}{\pgfqpoint{1.245cm}{1.508cm}}{\pgfqpoint{1.209cm}{1.508cm}}
\pgfpathcurveto{\pgfqpoint{1.172cm}{1.508cm}}{\pgfqpoint{1.138cm}{1.494cm}}{\pgfqpoint{1.112cm}{1.468cm}}
\pgfpathcurveto{\pgfqpoint{1.087cm}{1.442cm}}{\pgfqpoint{1.072cm}{1.408cm}}{\pgfqpoint{1.072cm}{1.371cm}}
\pgfpathcurveto{\pgfqpoint{1.072cm}{1.335cm}}{\pgfqpoint{1.087cm}{1.3cm}}{\pgfqpoint{1.112cm}{1.274cm}}
\pgfpathcurveto{\pgfqpoint{1.138cm}{1.249cm}}{\pgfqpoint{1.172cm}{1.234cm}}{\pgfqpoint{1.209cm}{1.234cm}}
\pgfpathcurveto{\pgfqpoint{1.245cm}{1.234cm}}{\pgfqpoint{1.28cm}{1.249cm}}{\pgfqpoint{1.305cm}{1.274cm}}
\pgfpathcurveto{\pgfqpoint{1.331cm}{1.3cm}}{\pgfqpoint{1.345cm}{1.335cm}}{\pgfqpoint{1.345cm}{1.371cm}}
\pgfusepath{fill}
\begin{pgfscope}
\pgfsetdash{}{0cm}
\pgfsetlinewidth{0.818mm}
\pgfsetroundcap
\pgfsetmiterlimit{4.0}
\pgfpathmoveto{\pgfqpoint{0.682cm}{0.671cm}}
\pgfpathlineto{\pgfqpoint{0.682cm}{0.042cm}}
\pgfusepath{stroke}
\end{pgfscope}
\end{pgfscope}
\end{pgfscope}
\end{pgfscope}
\end{tikzpicture}}}}- \rmm{3X^{\!\resizebox{!}{.8em}{
\begin{tikzpicture}
\pgfpathmoveto{\pgfqpoint{0cm}{-0.035cm}}
\pgfpathlineto{\pgfqpoint{1.976cm}{-0.035cm}}
\pgfpathlineto{\pgfqpoint{1.976cm}{1.94cm}}
\pgfpathlineto{\pgfqpoint{0cm}{1.94cm}}
\pgfpathclose
\pgfusepath{clip}
\begin{pgfscope}
\begin{pgfscope}
\pgfpathmoveto{\pgfqpoint{0cm}{-0.035cm}}
\pgfpathlineto{\pgfqpoint{1.976cm}{-0.035cm}}
\pgfpathlineto{\pgfqpoint{1.976cm}{1.94cm}}
\pgfpathlineto{\pgfqpoint{0cm}{1.94cm}}
\pgfpathclose
\pgfusepath{clip}
\begin{pgfscope}
\begin{pgfscope}
\pgfsetdash{}{0cm}
\pgfsetlinewidth{0.818mm}
\pgfsetroundcap
\pgfsetroundjoin
\pgfsetmiterlimit{7.0}
\definecolor{eps2pgf_color}{gray}{0}\pgfsetstrokecolor{eps2pgf_color}\pgfsetfillcolor{eps2pgf_color}
\pgfpathmoveto{\pgfqpoint{0.117cm}{1.815cm}}
\pgfpathlineto{\pgfqpoint{0.682cm}{1.065cm}}
\pgfpathlineto{\pgfqpoint{1.246cm}{1.815cm}}
\pgfusepath{stroke}
\end{pgfscope}
\definecolor{eps2pgf_color}{gray}{0}\pgfsetstrokecolor{eps2pgf_color}\pgfsetfillcolor{eps2pgf_color}
\pgfpathmoveto{\pgfqpoint{0.273cm}{1.789cm}}
\pgfpathcurveto{\pgfqpoint{0.273cm}{1.825cm}}{\pgfqpoint{0.259cm}{1.86cm}}{\pgfqpoint{0.233cm}{1.886cm}}
\pgfpathcurveto{\pgfqpoint{0.207cm}{1.912cm}}{\pgfqpoint{0.173cm}{1.926cm}}{\pgfqpoint{0.137cm}{1.926cm}}
\pgfpathcurveto{\pgfqpoint{0.1cm}{1.926cm}}{\pgfqpoint{0.066cm}{1.912cm}}{\pgfqpoint{0.04cm}{1.886cm}}
\pgfpathcurveto{\pgfqpoint{0.014cm}{1.86cm}}{\pgfqpoint{0cm}{1.825cm}}{\pgfqpoint{0cm}{1.789cm}}
\pgfpathcurveto{\pgfqpoint{0cm}{1.753cm}}{\pgfqpoint{0.014cm}{1.718cm}}{\pgfqpoint{0.04cm}{1.692cm}}
\pgfpathcurveto{\pgfqpoint{0.066cm}{1.667cm}}{\pgfqpoint{0.1cm}{1.652cm}}{\pgfqpoint{0.137cm}{1.652cm}}
\pgfpathcurveto{\pgfqpoint{0.173cm}{1.652cm}}{\pgfqpoint{0.207cm}{1.667cm}}{\pgfqpoint{0.233cm}{1.692cm}}
\pgfpathcurveto{\pgfqpoint{0.259cm}{1.718cm}}{\pgfqpoint{0.273cm}{1.753cm}}{\pgfqpoint{0.273cm}{1.789cm}}
\pgfusepath{fill}
\begin{pgfscope}
\pgfsetdash{}{0cm}
\pgfsetlinewidth{0.818mm}
\pgfsetmiterlimit{7.0}
\pgfpathmoveto{\pgfqpoint{0.682cm}{1.065cm}}
\pgfpathlineto{\pgfqpoint{0.679cm}{1.812cm}}
\pgfusepath{stroke}
\end{pgfscope}
\pgfpathmoveto{\pgfqpoint{0.815cm}{1.793cm}}
\pgfpathcurveto{\pgfqpoint{0.815cm}{1.829cm}}{\pgfqpoint{0.801cm}{1.864cm}}{\pgfqpoint{0.775cm}{1.89cm}}
\pgfpathcurveto{\pgfqpoint{0.75cm}{1.915cm}}{\pgfqpoint{0.715cm}{1.93cm}}{\pgfqpoint{0.679cm}{1.93cm}}
\pgfpathcurveto{\pgfqpoint{0.643cm}{1.93cm}}{\pgfqpoint{0.608cm}{1.915cm}}{\pgfqpoint{0.582cm}{1.89cm}}
\pgfpathcurveto{\pgfqpoint{0.557cm}{1.864cm}}{\pgfqpoint{0.542cm}{1.829cm}}{\pgfqpoint{0.542cm}{1.793cm}}
\pgfpathcurveto{\pgfqpoint{0.542cm}{1.756cm}}{\pgfqpoint{0.557cm}{1.722cm}}{\pgfqpoint{0.582cm}{1.696cm}}
\pgfpathcurveto{\pgfqpoint{0.608cm}{1.67cm}}{\pgfqpoint{0.643cm}{1.656cm}}{\pgfqpoint{0.679cm}{1.656cm}}
\pgfpathcurveto{\pgfqpoint{0.715cm}{1.656cm}}{\pgfqpoint{0.75cm}{1.67cm}}{\pgfqpoint{0.775cm}{1.696cm}}
\pgfpathcurveto{\pgfqpoint{0.801cm}{1.722cm}}{\pgfqpoint{0.815cm}{1.756cm}}{\pgfqpoint{0.815cm}{1.793cm}}
\pgfusepath{fill}
\pgfpathmoveto{\pgfqpoint{1.345cm}{1.765cm}}
\pgfpathcurveto{\pgfqpoint{1.345cm}{1.801cm}}{\pgfqpoint{1.331cm}{1.836cm}}{\pgfqpoint{1.305cm}{1.862cm}}
\pgfpathcurveto{\pgfqpoint{1.28cm}{1.887cm}}{\pgfqpoint{1.245cm}{1.902cm}}{\pgfqpoint{1.209cm}{1.902cm}}
\pgfpathcurveto{\pgfqpoint{1.172cm}{1.902cm}}{\pgfqpoint{1.138cm}{1.887cm}}{\pgfqpoint{1.112cm}{1.862cm}}
\pgfpathcurveto{\pgfqpoint{1.087cm}{1.836cm}}{\pgfqpoint{1.072cm}{1.801cm}}{\pgfqpoint{1.072cm}{1.765cm}}
\pgfpathcurveto{\pgfqpoint{1.072cm}{1.728cm}}{\pgfqpoint{1.087cm}{1.694cm}}{\pgfqpoint{1.112cm}{1.668cm}}
\pgfpathcurveto{\pgfqpoint{1.138cm}{1.642cm}}{\pgfqpoint{1.172cm}{1.628cm}}{\pgfqpoint{1.209cm}{1.628cm}}
\pgfpathcurveto{\pgfqpoint{1.245cm}{1.628cm}}{\pgfqpoint{1.28cm}{1.642cm}}{\pgfqpoint{1.305cm}{1.668cm}}
\pgfpathcurveto{\pgfqpoint{1.331cm}{1.694cm}}{\pgfqpoint{1.345cm}{1.728cm}}{\pgfqpoint{1.345cm}{1.765cm}}
\pgfusepath{fill}
\begin{pgfscope}
\pgfsetdash{}{0cm}
\pgfsetlinewidth{0.818mm}
\pgfsetroundcap
\pgfsetroundjoin
\pgfsetmiterlimit{7.0}
\pgfpathmoveto{\pgfqpoint{0.682cm}{1.065cm}}
\pgfpathlineto{\pgfqpoint{1.246cm}{0.315cm}}
\pgfpathlineto{\pgfqpoint{1.811cm}{1.065cm}}
\pgfusepath{stroke}
\end{pgfscope}
\pgfpathmoveto{\pgfqpoint{1.948cm}{1.065cm}}
\pgfpathcurveto{\pgfqpoint{1.948cm}{1.101cm}}{\pgfqpoint{1.933cm}{1.136cm}}{\pgfqpoint{1.907cm}{1.162cm}}
\pgfpathcurveto{\pgfqpoint{1.882cm}{1.187cm}}{\pgfqpoint{1.847cm}{1.202cm}}{\pgfqpoint{1.811cm}{1.202cm}}
\pgfpathcurveto{\pgfqpoint{1.775cm}{1.202cm}}{\pgfqpoint{1.74cm}{1.187cm}}{\pgfqpoint{1.714cm}{1.162cm}}
\pgfpathcurveto{\pgfqpoint{1.689cm}{1.136cm}}{\pgfqpoint{1.674cm}{1.101cm}}{\pgfqpoint{1.674cm}{1.065cm}}
\pgfpathcurveto{\pgfqpoint{1.674cm}{1.029cm}}{\pgfqpoint{1.689cm}{0.994cm}}{\pgfqpoint{1.714cm}{0.968cm}}
\pgfpathcurveto{\pgfqpoint{1.74cm}{0.942cm}}{\pgfqpoint{1.775cm}{0.928cm}}{\pgfqpoint{1.811cm}{0.928cm}}
\pgfpathcurveto{\pgfqpoint{1.847cm}{0.928cm}}{\pgfqpoint{1.882cm}{0.942cm}}{\pgfqpoint{1.907cm}{0.968cm}}
\pgfpathcurveto{\pgfqpoint{1.933cm}{0.994cm}}{\pgfqpoint{1.948cm}{1.029cm}}{\pgfqpoint{1.948cm}{1.065cm}}
\pgfusepath{fill}
\begin{pgfscope}
\pgfsetdash{}{0cm}
\pgfsetlinewidth{0.818mm}
\pgfsetmiterlimit{7.0}
\pgfpathmoveto{\pgfqpoint{1.246cm}{0.315cm}}
\pgfpathlineto{\pgfqpoint{1.244cm}{1.061cm}}
\pgfusepath{stroke}
\end{pgfscope}
\pgfpathmoveto{\pgfqpoint{1.38cm}{1.065cm}}
\pgfpathcurveto{\pgfqpoint{1.38cm}{1.101cm}}{\pgfqpoint{1.366cm}{1.136cm}}{\pgfqpoint{1.34cm}{1.162cm}}
\pgfpathcurveto{\pgfqpoint{1.315cm}{1.187cm}}{\pgfqpoint{1.28cm}{1.202cm}}{\pgfqpoint{1.244cm}{1.202cm}}
\pgfpathcurveto{\pgfqpoint{1.207cm}{1.202cm}}{\pgfqpoint{1.173cm}{1.187cm}}{\pgfqpoint{1.147cm}{1.162cm}}
\pgfpathcurveto{\pgfqpoint{1.121cm}{1.136cm}}{\pgfqpoint{1.107cm}{1.101cm}}{\pgfqpoint{1.107cm}{1.065cm}}
\pgfpathcurveto{\pgfqpoint{1.107cm}{1.029cm}}{\pgfqpoint{1.121cm}{0.994cm}}{\pgfqpoint{1.147cm}{0.968cm}}
\pgfpathcurveto{\pgfqpoint{1.173cm}{0.942cm}}{\pgfqpoint{1.207cm}{0.928cm}}{\pgfqpoint{1.244cm}{0.928cm}}
\pgfpathcurveto{\pgfqpoint{1.28cm}{0.928cm}}{\pgfqpoint{1.315cm}{0.942cm}}{\pgfqpoint{1.34cm}{0.968cm}}
\pgfpathcurveto{\pgfqpoint{1.366cm}{0.994cm}}{\pgfqpoint{1.38cm}{1.029cm}}{\pgfqpoint{1.38cm}{1.065cm}}
\pgfusepath{fill}
\begin{pgfscope}
\pgfsetdash{}{0cm}
\pgfsetlinewidth{0.818mm}
\pgfsetmiterlimit{4.0}
\pgfpathmoveto{\pgfqpoint{1.383cm}{0.178cm}}
\pgfpathcurveto{\pgfqpoint{1.383cm}{0.214cm}}{\pgfqpoint{1.369cm}{0.249cm}}{\pgfqpoint{1.343cm}{0.275cm}}
\pgfpathcurveto{\pgfqpoint{1.317cm}{0.3cm}}{\pgfqpoint{1.283cm}{0.315cm}}{\pgfqpoint{1.246cm}{0.315cm}}
\pgfpathcurveto{\pgfqpoint{1.21cm}{0.315cm}}{\pgfqpoint{1.175cm}{0.3cm}}{\pgfqpoint{1.15cm}{0.275cm}}
\pgfpathcurveto{\pgfqpoint{1.124cm}{0.249cm}}{\pgfqpoint{1.11cm}{0.214cm}}{\pgfqpoint{1.11cm}{0.178cm}}
\pgfpathcurveto{\pgfqpoint{1.11cm}{0.141cm}}{\pgfqpoint{1.124cm}{0.107cm}}{\pgfqpoint{1.15cm}{0.081cm}}
\pgfpathcurveto{\pgfqpoint{1.175cm}{0.055cm}}{\pgfqpoint{1.21cm}{0.041cm}}{\pgfqpoint{1.246cm}{0.041cm}}
\pgfpathcurveto{\pgfqpoint{1.283cm}{0.041cm}}{\pgfqpoint{1.317cm}{0.055cm}}{\pgfqpoint{1.343cm}{0.081cm}}
\pgfpathcurveto{\pgfqpoint{1.369cm}{0.107cm}}{\pgfqpoint{1.383cm}{0.141cm}}{\pgfqpoint{1.383cm}{0.178cm}}
\pgfusepath{stroke}
\end{pgfscope}
\end{pgfscope}
\end{pgfscope}
\end{pgfscope}
\end{tikzpicture}}}}+\rmm{3X\succ(X^{\!\resizebox{0.6em}{!}{
\begin{tikzpicture}
\pgfpathmoveto{\pgfqpoint{0cm}{-0.035cm}}
\pgfpathlineto{\pgfqpoint{1.376cm}{-0.035cm}}
\pgfpathlineto{\pgfqpoint{1.376cm}{1.552cm}}
\pgfpathlineto{\pgfqpoint{0cm}{1.552cm}}
\pgfpathclose
\pgfusepath{clip}
\begin{pgfscope}
\begin{pgfscope}
\pgfpathmoveto{\pgfqpoint{0cm}{-0.035cm}}
\pgfpathlineto{\pgfqpoint{1.376cm}{-0.035cm}}
\pgfpathlineto{\pgfqpoint{1.376cm}{1.552cm}}
\pgfpathlineto{\pgfqpoint{0cm}{1.552cm}}
\pgfpathclose
\pgfusepath{clip}
\begin{pgfscope}
\begin{pgfscope}
\pgfsetdash{}{0cm}
\pgfsetlinewidth{0.818mm}
\pgfsetroundcap
\pgfsetroundjoin
\pgfsetmiterlimit{7.0}
\definecolor{eps2pgf_color}{gray}{0}\pgfsetstrokecolor{eps2pgf_color}\pgfsetfillcolor{eps2pgf_color}
\pgfpathmoveto{\pgfqpoint{0.117cm}{1.421cm}}
\pgfpathlineto{\pgfqpoint{0.682cm}{0.671cm}}
\pgfpathlineto{\pgfqpoint{1.246cm}{1.421cm}}
\pgfusepath{stroke}
\end{pgfscope}
\definecolor{eps2pgf_color}{gray}{0}\pgfsetstrokecolor{eps2pgf_color}\pgfsetfillcolor{eps2pgf_color}
\pgfpathmoveto{\pgfqpoint{0.273cm}{1.395cm}}
\pgfpathcurveto{\pgfqpoint{0.273cm}{1.432cm}}{\pgfqpoint{0.259cm}{1.467cm}}{\pgfqpoint{0.233cm}{1.492cm}}
\pgfpathcurveto{\pgfqpoint{0.207cm}{1.518cm}}{\pgfqpoint{0.173cm}{1.532cm}}{\pgfqpoint{0.137cm}{1.532cm}}
\pgfpathcurveto{\pgfqpoint{0.1cm}{1.532cm}}{\pgfqpoint{0.066cm}{1.518cm}}{\pgfqpoint{0.04cm}{1.492cm}}
\pgfpathcurveto{\pgfqpoint{0.014cm}{1.467cm}}{\pgfqpoint{0cm}{1.432cm}}{\pgfqpoint{0cm}{1.395cm}}
\pgfpathcurveto{\pgfqpoint{0cm}{1.359cm}}{\pgfqpoint{0.014cm}{1.324cm}}{\pgfqpoint{0.04cm}{1.299cm}}
\pgfpathcurveto{\pgfqpoint{0.066cm}{1.273cm}}{\pgfqpoint{0.1cm}{1.258cm}}{\pgfqpoint{0.137cm}{1.258cm}}
\pgfpathcurveto{\pgfqpoint{0.173cm}{1.258cm}}{\pgfqpoint{0.207cm}{1.273cm}}{\pgfqpoint{0.233cm}{1.299cm}}
\pgfpathcurveto{\pgfqpoint{0.259cm}{1.324cm}}{\pgfqpoint{0.273cm}{1.359cm}}{\pgfqpoint{0.273cm}{1.395cm}}
\pgfusepath{fill}
\begin{pgfscope}
\pgfsetdash{}{0cm}
\pgfsetlinewidth{0.818mm}
\pgfsetmiterlimit{7.0}
\pgfpathmoveto{\pgfqpoint{0.682cm}{0.671cm}}
\pgfpathlineto{\pgfqpoint{0.679cm}{1.418cm}}
\pgfusepath{stroke}
\end{pgfscope}
\pgfpathmoveto{\pgfqpoint{0.815cm}{1.399cm}}
\pgfpathcurveto{\pgfqpoint{0.815cm}{1.435cm}}{\pgfqpoint{0.801cm}{1.47cm}}{\pgfqpoint{0.775cm}{1.496cm}}
\pgfpathcurveto{\pgfqpoint{0.75cm}{1.521cm}}{\pgfqpoint{0.715cm}{1.536cm}}{\pgfqpoint{0.679cm}{1.536cm}}
\pgfpathcurveto{\pgfqpoint{0.643cm}{1.536cm}}{\pgfqpoint{0.608cm}{1.521cm}}{\pgfqpoint{0.582cm}{1.496cm}}
\pgfpathcurveto{\pgfqpoint{0.557cm}{1.47cm}}{\pgfqpoint{0.542cm}{1.435cm}}{\pgfqpoint{0.542cm}{1.399cm}}
\pgfpathcurveto{\pgfqpoint{0.542cm}{1.363cm}}{\pgfqpoint{0.557cm}{1.328cm}}{\pgfqpoint{0.582cm}{1.302cm}}
\pgfpathcurveto{\pgfqpoint{0.608cm}{1.276cm}}{\pgfqpoint{0.643cm}{1.262cm}}{\pgfqpoint{0.679cm}{1.262cm}}
\pgfpathcurveto{\pgfqpoint{0.715cm}{1.262cm}}{\pgfqpoint{0.75cm}{1.276cm}}{\pgfqpoint{0.775cm}{1.302cm}}
\pgfpathcurveto{\pgfqpoint{0.801cm}{1.328cm}}{\pgfqpoint{0.815cm}{1.363cm}}{\pgfqpoint{0.815cm}{1.399cm}}
\pgfusepath{fill}
\pgfpathmoveto{\pgfqpoint{1.345cm}{1.371cm}}
\pgfpathcurveto{\pgfqpoint{1.345cm}{1.408cm}}{\pgfqpoint{1.331cm}{1.442cm}}{\pgfqpoint{1.305cm}{1.468cm}}
\pgfpathcurveto{\pgfqpoint{1.28cm}{1.494cm}}{\pgfqpoint{1.245cm}{1.508cm}}{\pgfqpoint{1.209cm}{1.508cm}}
\pgfpathcurveto{\pgfqpoint{1.172cm}{1.508cm}}{\pgfqpoint{1.138cm}{1.494cm}}{\pgfqpoint{1.112cm}{1.468cm}}
\pgfpathcurveto{\pgfqpoint{1.087cm}{1.442cm}}{\pgfqpoint{1.072cm}{1.408cm}}{\pgfqpoint{1.072cm}{1.371cm}}
\pgfpathcurveto{\pgfqpoint{1.072cm}{1.335cm}}{\pgfqpoint{1.087cm}{1.3cm}}{\pgfqpoint{1.112cm}{1.274cm}}
\pgfpathcurveto{\pgfqpoint{1.138cm}{1.249cm}}{\pgfqpoint{1.172cm}{1.234cm}}{\pgfqpoint{1.209cm}{1.234cm}}
\pgfpathcurveto{\pgfqpoint{1.245cm}{1.234cm}}{\pgfqpoint{1.28cm}{1.249cm}}{\pgfqpoint{1.305cm}{1.274cm}}
\pgfpathcurveto{\pgfqpoint{1.331cm}{1.3cm}}{\pgfqpoint{1.345cm}{1.335cm}}{\pgfqpoint{1.345cm}{1.371cm}}
\pgfusepath{fill}
\begin{pgfscope}
\pgfsetdash{}{0cm}
\pgfsetlinewidth{0.818mm}
\pgfsetroundcap
\pgfsetmiterlimit{4.0}
\pgfpathmoveto{\pgfqpoint{0.682cm}{0.671cm}}
\pgfpathlineto{\pgfqpoint{0.682cm}{0.042cm}}
\pgfusepath{stroke}
\end{pgfscope}
\end{pgfscope}
\end{pgfscope}
\end{pgfscope}
\end{tikzpicture}}})^2}+\rmm{3X\prec(X^{\!\resizebox{0.6em}{!}{
\begin{tikzpicture}
\pgfpathmoveto{\pgfqpoint{0cm}{-0.035cm}}
\pgfpathlineto{\pgfqpoint{1.376cm}{-0.035cm}}
\pgfpathlineto{\pgfqpoint{1.376cm}{1.552cm}}
\pgfpathlineto{\pgfqpoint{0cm}{1.552cm}}
\pgfpathclose
\pgfusepath{clip}
\begin{pgfscope}
\begin{pgfscope}
\pgfpathmoveto{\pgfqpoint{0cm}{-0.035cm}}
\pgfpathlineto{\pgfqpoint{1.376cm}{-0.035cm}}
\pgfpathlineto{\pgfqpoint{1.376cm}{1.552cm}}
\pgfpathlineto{\pgfqpoint{0cm}{1.552cm}}
\pgfpathclose
\pgfusepath{clip}
\begin{pgfscope}
\begin{pgfscope}
\pgfsetdash{}{0cm}
\pgfsetlinewidth{0.818mm}
\pgfsetroundcap
\pgfsetroundjoin
\pgfsetmiterlimit{7.0}
\definecolor{eps2pgf_color}{gray}{0}\pgfsetstrokecolor{eps2pgf_color}\pgfsetfillcolor{eps2pgf_color}
\pgfpathmoveto{\pgfqpoint{0.117cm}{1.421cm}}
\pgfpathlineto{\pgfqpoint{0.682cm}{0.671cm}}
\pgfpathlineto{\pgfqpoint{1.246cm}{1.421cm}}
\pgfusepath{stroke}
\end{pgfscope}
\definecolor{eps2pgf_color}{gray}{0}\pgfsetstrokecolor{eps2pgf_color}\pgfsetfillcolor{eps2pgf_color}
\pgfpathmoveto{\pgfqpoint{0.273cm}{1.395cm}}
\pgfpathcurveto{\pgfqpoint{0.273cm}{1.432cm}}{\pgfqpoint{0.259cm}{1.467cm}}{\pgfqpoint{0.233cm}{1.492cm}}
\pgfpathcurveto{\pgfqpoint{0.207cm}{1.518cm}}{\pgfqpoint{0.173cm}{1.532cm}}{\pgfqpoint{0.137cm}{1.532cm}}
\pgfpathcurveto{\pgfqpoint{0.1cm}{1.532cm}}{\pgfqpoint{0.066cm}{1.518cm}}{\pgfqpoint{0.04cm}{1.492cm}}
\pgfpathcurveto{\pgfqpoint{0.014cm}{1.467cm}}{\pgfqpoint{0cm}{1.432cm}}{\pgfqpoint{0cm}{1.395cm}}
\pgfpathcurveto{\pgfqpoint{0cm}{1.359cm}}{\pgfqpoint{0.014cm}{1.324cm}}{\pgfqpoint{0.04cm}{1.299cm}}
\pgfpathcurveto{\pgfqpoint{0.066cm}{1.273cm}}{\pgfqpoint{0.1cm}{1.258cm}}{\pgfqpoint{0.137cm}{1.258cm}}
\pgfpathcurveto{\pgfqpoint{0.173cm}{1.258cm}}{\pgfqpoint{0.207cm}{1.273cm}}{\pgfqpoint{0.233cm}{1.299cm}}
\pgfpathcurveto{\pgfqpoint{0.259cm}{1.324cm}}{\pgfqpoint{0.273cm}{1.359cm}}{\pgfqpoint{0.273cm}{1.395cm}}
\pgfusepath{fill}
\begin{pgfscope}
\pgfsetdash{}{0cm}
\pgfsetlinewidth{0.818mm}
\pgfsetmiterlimit{7.0}
\pgfpathmoveto{\pgfqpoint{0.682cm}{0.671cm}}
\pgfpathlineto{\pgfqpoint{0.679cm}{1.418cm}}
\pgfusepath{stroke}
\end{pgfscope}
\pgfpathmoveto{\pgfqpoint{0.815cm}{1.399cm}}
\pgfpathcurveto{\pgfqpoint{0.815cm}{1.435cm}}{\pgfqpoint{0.801cm}{1.47cm}}{\pgfqpoint{0.775cm}{1.496cm}}
\pgfpathcurveto{\pgfqpoint{0.75cm}{1.521cm}}{\pgfqpoint{0.715cm}{1.536cm}}{\pgfqpoint{0.679cm}{1.536cm}}
\pgfpathcurveto{\pgfqpoint{0.643cm}{1.536cm}}{\pgfqpoint{0.608cm}{1.521cm}}{\pgfqpoint{0.582cm}{1.496cm}}
\pgfpathcurveto{\pgfqpoint{0.557cm}{1.47cm}}{\pgfqpoint{0.542cm}{1.435cm}}{\pgfqpoint{0.542cm}{1.399cm}}
\pgfpathcurveto{\pgfqpoint{0.542cm}{1.363cm}}{\pgfqpoint{0.557cm}{1.328cm}}{\pgfqpoint{0.582cm}{1.302cm}}
\pgfpathcurveto{\pgfqpoint{0.608cm}{1.276cm}}{\pgfqpoint{0.643cm}{1.262cm}}{\pgfqpoint{0.679cm}{1.262cm}}
\pgfpathcurveto{\pgfqpoint{0.715cm}{1.262cm}}{\pgfqpoint{0.75cm}{1.276cm}}{\pgfqpoint{0.775cm}{1.302cm}}
\pgfpathcurveto{\pgfqpoint{0.801cm}{1.328cm}}{\pgfqpoint{0.815cm}{1.363cm}}{\pgfqpoint{0.815cm}{1.399cm}}
\pgfusepath{fill}
\pgfpathmoveto{\pgfqpoint{1.345cm}{1.371cm}}
\pgfpathcurveto{\pgfqpoint{1.345cm}{1.408cm}}{\pgfqpoint{1.331cm}{1.442cm}}{\pgfqpoint{1.305cm}{1.468cm}}
\pgfpathcurveto{\pgfqpoint{1.28cm}{1.494cm}}{\pgfqpoint{1.245cm}{1.508cm}}{\pgfqpoint{1.209cm}{1.508cm}}
\pgfpathcurveto{\pgfqpoint{1.172cm}{1.508cm}}{\pgfqpoint{1.138cm}{1.494cm}}{\pgfqpoint{1.112cm}{1.468cm}}
\pgfpathcurveto{\pgfqpoint{1.087cm}{1.442cm}}{\pgfqpoint{1.072cm}{1.408cm}}{\pgfqpoint{1.072cm}{1.371cm}}
\pgfpathcurveto{\pgfqpoint{1.072cm}{1.335cm}}{\pgfqpoint{1.087cm}{1.3cm}}{\pgfqpoint{1.112cm}{1.274cm}}
\pgfpathcurveto{\pgfqpoint{1.138cm}{1.249cm}}{\pgfqpoint{1.172cm}{1.234cm}}{\pgfqpoint{1.209cm}{1.234cm}}
\pgfpathcurveto{\pgfqpoint{1.245cm}{1.234cm}}{\pgfqpoint{1.28cm}{1.249cm}}{\pgfqpoint{1.305cm}{1.274cm}}
\pgfpathcurveto{\pgfqpoint{1.331cm}{1.3cm}}{\pgfqpoint{1.345cm}{1.335cm}}{\pgfqpoint{1.345cm}{1.371cm}}
\pgfusepath{fill}
\begin{pgfscope}
\pgfsetdash{}{0cm}
\pgfsetlinewidth{0.818mm}
\pgfsetroundcap
\pgfsetmiterlimit{4.0}
\pgfpathmoveto{\pgfqpoint{0.682cm}{0.671cm}}
\pgfpathlineto{\pgfqpoint{0.682cm}{0.042cm}}
\pgfusepath{stroke}
\end{pgfscope}
\end{pgfscope}
\end{pgfscope}
\end{pgfscope}
\end{tikzpicture}}})^2}+\rmm{6X^{\!\resizebox{0.6em}{!}{
\begin{tikzpicture}
\pgfpathmoveto{\pgfqpoint{0cm}{-0.035cm}}
\pgfpathlineto{\pgfqpoint{1.376cm}{-0.035cm}}
\pgfpathlineto{\pgfqpoint{1.376cm}{1.552cm}}
\pgfpathlineto{\pgfqpoint{0cm}{1.552cm}}
\pgfpathclose
\pgfusepath{clip}
\begin{pgfscope}
\begin{pgfscope}
\pgfpathmoveto{\pgfqpoint{0cm}{-0.035cm}}
\pgfpathlineto{\pgfqpoint{1.376cm}{-0.035cm}}
\pgfpathlineto{\pgfqpoint{1.376cm}{1.552cm}}
\pgfpathlineto{\pgfqpoint{0cm}{1.552cm}}
\pgfpathclose
\pgfusepath{clip}
\begin{pgfscope}
\begin{pgfscope}
\pgfsetdash{}{0cm}
\pgfsetlinewidth{0.818mm}
\pgfsetroundcap
\pgfsetroundjoin
\pgfsetmiterlimit{7.0}
\definecolor{eps2pgf_color}{gray}{0}\pgfsetstrokecolor{eps2pgf_color}\pgfsetfillcolor{eps2pgf_color}
\pgfpathmoveto{\pgfqpoint{0.117cm}{1.421cm}}
\pgfpathlineto{\pgfqpoint{0.682cm}{0.671cm}}
\pgfpathlineto{\pgfqpoint{1.246cm}{1.421cm}}
\pgfusepath{stroke}
\end{pgfscope}
\definecolor{eps2pgf_color}{gray}{0}\pgfsetstrokecolor{eps2pgf_color}\pgfsetfillcolor{eps2pgf_color}
\pgfpathmoveto{\pgfqpoint{0.273cm}{1.395cm}}
\pgfpathcurveto{\pgfqpoint{0.273cm}{1.432cm}}{\pgfqpoint{0.259cm}{1.467cm}}{\pgfqpoint{0.233cm}{1.492cm}}
\pgfpathcurveto{\pgfqpoint{0.207cm}{1.518cm}}{\pgfqpoint{0.173cm}{1.532cm}}{\pgfqpoint{0.137cm}{1.532cm}}
\pgfpathcurveto{\pgfqpoint{0.1cm}{1.532cm}}{\pgfqpoint{0.066cm}{1.518cm}}{\pgfqpoint{0.04cm}{1.492cm}}
\pgfpathcurveto{\pgfqpoint{0.014cm}{1.467cm}}{\pgfqpoint{0cm}{1.432cm}}{\pgfqpoint{0cm}{1.395cm}}
\pgfpathcurveto{\pgfqpoint{0cm}{1.359cm}}{\pgfqpoint{0.014cm}{1.324cm}}{\pgfqpoint{0.04cm}{1.299cm}}
\pgfpathcurveto{\pgfqpoint{0.066cm}{1.273cm}}{\pgfqpoint{0.1cm}{1.258cm}}{\pgfqpoint{0.137cm}{1.258cm}}
\pgfpathcurveto{\pgfqpoint{0.173cm}{1.258cm}}{\pgfqpoint{0.207cm}{1.273cm}}{\pgfqpoint{0.233cm}{1.299cm}}
\pgfpathcurveto{\pgfqpoint{0.259cm}{1.324cm}}{\pgfqpoint{0.273cm}{1.359cm}}{\pgfqpoint{0.273cm}{1.395cm}}
\pgfusepath{fill}
\begin{pgfscope}
\pgfsetdash{}{0cm}
\pgfsetlinewidth{0.818mm}
\pgfsetmiterlimit{7.0}
\pgfpathmoveto{\pgfqpoint{0.682cm}{0.671cm}}
\pgfpathlineto{\pgfqpoint{0.679cm}{1.418cm}}
\pgfusepath{stroke}
\end{pgfscope}
\pgfpathmoveto{\pgfqpoint{0.815cm}{1.399cm}}
\pgfpathcurveto{\pgfqpoint{0.815cm}{1.435cm}}{\pgfqpoint{0.801cm}{1.47cm}}{\pgfqpoint{0.775cm}{1.496cm}}
\pgfpathcurveto{\pgfqpoint{0.75cm}{1.521cm}}{\pgfqpoint{0.715cm}{1.536cm}}{\pgfqpoint{0.679cm}{1.536cm}}
\pgfpathcurveto{\pgfqpoint{0.643cm}{1.536cm}}{\pgfqpoint{0.608cm}{1.521cm}}{\pgfqpoint{0.582cm}{1.496cm}}
\pgfpathcurveto{\pgfqpoint{0.557cm}{1.47cm}}{\pgfqpoint{0.542cm}{1.435cm}}{\pgfqpoint{0.542cm}{1.399cm}}
\pgfpathcurveto{\pgfqpoint{0.542cm}{1.363cm}}{\pgfqpoint{0.557cm}{1.328cm}}{\pgfqpoint{0.582cm}{1.302cm}}
\pgfpathcurveto{\pgfqpoint{0.608cm}{1.276cm}}{\pgfqpoint{0.643cm}{1.262cm}}{\pgfqpoint{0.679cm}{1.262cm}}
\pgfpathcurveto{\pgfqpoint{0.715cm}{1.262cm}}{\pgfqpoint{0.75cm}{1.276cm}}{\pgfqpoint{0.775cm}{1.302cm}}
\pgfpathcurveto{\pgfqpoint{0.801cm}{1.328cm}}{\pgfqpoint{0.815cm}{1.363cm}}{\pgfqpoint{0.815cm}{1.399cm}}
\pgfusepath{fill}
\pgfpathmoveto{\pgfqpoint{1.345cm}{1.371cm}}
\pgfpathcurveto{\pgfqpoint{1.345cm}{1.408cm}}{\pgfqpoint{1.331cm}{1.442cm}}{\pgfqpoint{1.305cm}{1.468cm}}
\pgfpathcurveto{\pgfqpoint{1.28cm}{1.494cm}}{\pgfqpoint{1.245cm}{1.508cm}}{\pgfqpoint{1.209cm}{1.508cm}}
\pgfpathcurveto{\pgfqpoint{1.172cm}{1.508cm}}{\pgfqpoint{1.138cm}{1.494cm}}{\pgfqpoint{1.112cm}{1.468cm}}
\pgfpathcurveto{\pgfqpoint{1.087cm}{1.442cm}}{\pgfqpoint{1.072cm}{1.408cm}}{\pgfqpoint{1.072cm}{1.371cm}}
\pgfpathcurveto{\pgfqpoint{1.072cm}{1.335cm}}{\pgfqpoint{1.087cm}{1.3cm}}{\pgfqpoint{1.112cm}{1.274cm}}
\pgfpathcurveto{\pgfqpoint{1.138cm}{1.249cm}}{\pgfqpoint{1.172cm}{1.234cm}}{\pgfqpoint{1.209cm}{1.234cm}}
\pgfpathcurveto{\pgfqpoint{1.245cm}{1.234cm}}{\pgfqpoint{1.28cm}{1.249cm}}{\pgfqpoint{1.305cm}{1.274cm}}
\pgfpathcurveto{\pgfqpoint{1.331cm}{1.3cm}}{\pgfqpoint{1.345cm}{1.335cm}}{\pgfqpoint{1.345cm}{1.371cm}}
\pgfusepath{fill}
\begin{pgfscope}
\pgfsetdash{}{0cm}
\pgfsetlinewidth{0.818mm}
\pgfsetroundcap
\pgfsetmiterlimit{4.0}
\pgfpathmoveto{\pgfqpoint{0.682cm}{0.671cm}}
\pgfpathlineto{\pgfqpoint{0.682cm}{0.042cm}}
\pgfusepath{stroke}
\end{pgfscope}
\end{pgfscope}
\end{pgfscope}
\end{pgfscope}
\end{tikzpicture}}}X^{\!\resizebox{!}{.8em}{
\begin{tikzpicture}
\pgfpathmoveto{\pgfqpoint{0cm}{-0.035cm}}
\pgfpathlineto{\pgfqpoint{1.976cm}{-0.035cm}}
\pgfpathlineto{\pgfqpoint{1.976cm}{1.94cm}}
\pgfpathlineto{\pgfqpoint{0cm}{1.94cm}}
\pgfpathclose
\pgfusepath{clip}
\begin{pgfscope}
\begin{pgfscope}
\pgfpathmoveto{\pgfqpoint{0cm}{-0.035cm}}
\pgfpathlineto{\pgfqpoint{1.976cm}{-0.035cm}}
\pgfpathlineto{\pgfqpoint{1.976cm}{1.94cm}}
\pgfpathlineto{\pgfqpoint{0cm}{1.94cm}}
\pgfpathclose
\pgfusepath{clip}
\begin{pgfscope}
\begin{pgfscope}
\pgfsetdash{}{0cm}
\pgfsetlinewidth{0.818mm}
\pgfsetroundcap
\pgfsetroundjoin
\pgfsetmiterlimit{7.0}
\definecolor{eps2pgf_color}{gray}{0}\pgfsetstrokecolor{eps2pgf_color}\pgfsetfillcolor{eps2pgf_color}
\pgfpathmoveto{\pgfqpoint{0.117cm}{1.815cm}}
\pgfpathlineto{\pgfqpoint{0.682cm}{1.065cm}}
\pgfpathlineto{\pgfqpoint{1.246cm}{1.815cm}}
\pgfusepath{stroke}
\end{pgfscope}
\definecolor{eps2pgf_color}{gray}{0}\pgfsetstrokecolor{eps2pgf_color}\pgfsetfillcolor{eps2pgf_color}
\pgfpathmoveto{\pgfqpoint{0.273cm}{1.789cm}}
\pgfpathcurveto{\pgfqpoint{0.273cm}{1.825cm}}{\pgfqpoint{0.259cm}{1.86cm}}{\pgfqpoint{0.233cm}{1.886cm}}
\pgfpathcurveto{\pgfqpoint{0.207cm}{1.912cm}}{\pgfqpoint{0.173cm}{1.926cm}}{\pgfqpoint{0.137cm}{1.926cm}}
\pgfpathcurveto{\pgfqpoint{0.1cm}{1.926cm}}{\pgfqpoint{0.066cm}{1.912cm}}{\pgfqpoint{0.04cm}{1.886cm}}
\pgfpathcurveto{\pgfqpoint{0.014cm}{1.86cm}}{\pgfqpoint{0cm}{1.825cm}}{\pgfqpoint{0cm}{1.789cm}}
\pgfpathcurveto{\pgfqpoint{0cm}{1.753cm}}{\pgfqpoint{0.014cm}{1.718cm}}{\pgfqpoint{0.04cm}{1.692cm}}
\pgfpathcurveto{\pgfqpoint{0.066cm}{1.667cm}}{\pgfqpoint{0.1cm}{1.652cm}}{\pgfqpoint{0.137cm}{1.652cm}}
\pgfpathcurveto{\pgfqpoint{0.173cm}{1.652cm}}{\pgfqpoint{0.207cm}{1.667cm}}{\pgfqpoint{0.233cm}{1.692cm}}
\pgfpathcurveto{\pgfqpoint{0.259cm}{1.718cm}}{\pgfqpoint{0.273cm}{1.753cm}}{\pgfqpoint{0.273cm}{1.789cm}}
\pgfusepath{fill}
\begin{pgfscope}
\pgfsetdash{}{0cm}
\pgfsetlinewidth{0.818mm}
\pgfsetmiterlimit{7.0}
\pgfpathmoveto{\pgfqpoint{0.682cm}{1.065cm}}
\pgfpathlineto{\pgfqpoint{0.679cm}{1.812cm}}
\pgfusepath{stroke}
\end{pgfscope}
\pgfpathmoveto{\pgfqpoint{0.815cm}{1.793cm}}
\pgfpathcurveto{\pgfqpoint{0.815cm}{1.829cm}}{\pgfqpoint{0.801cm}{1.864cm}}{\pgfqpoint{0.775cm}{1.89cm}}
\pgfpathcurveto{\pgfqpoint{0.75cm}{1.915cm}}{\pgfqpoint{0.715cm}{1.93cm}}{\pgfqpoint{0.679cm}{1.93cm}}
\pgfpathcurveto{\pgfqpoint{0.643cm}{1.93cm}}{\pgfqpoint{0.608cm}{1.915cm}}{\pgfqpoint{0.582cm}{1.89cm}}
\pgfpathcurveto{\pgfqpoint{0.557cm}{1.864cm}}{\pgfqpoint{0.542cm}{1.829cm}}{\pgfqpoint{0.542cm}{1.793cm}}
\pgfpathcurveto{\pgfqpoint{0.542cm}{1.756cm}}{\pgfqpoint{0.557cm}{1.722cm}}{\pgfqpoint{0.582cm}{1.696cm}}
\pgfpathcurveto{\pgfqpoint{0.608cm}{1.67cm}}{\pgfqpoint{0.643cm}{1.656cm}}{\pgfqpoint{0.679cm}{1.656cm}}
\pgfpathcurveto{\pgfqpoint{0.715cm}{1.656cm}}{\pgfqpoint{0.75cm}{1.67cm}}{\pgfqpoint{0.775cm}{1.696cm}}
\pgfpathcurveto{\pgfqpoint{0.801cm}{1.722cm}}{\pgfqpoint{0.815cm}{1.756cm}}{\pgfqpoint{0.815cm}{1.793cm}}
\pgfusepath{fill}
\pgfpathmoveto{\pgfqpoint{1.345cm}{1.765cm}}
\pgfpathcurveto{\pgfqpoint{1.345cm}{1.801cm}}{\pgfqpoint{1.331cm}{1.836cm}}{\pgfqpoint{1.305cm}{1.862cm}}
\pgfpathcurveto{\pgfqpoint{1.28cm}{1.887cm}}{\pgfqpoint{1.245cm}{1.902cm}}{\pgfqpoint{1.209cm}{1.902cm}}
\pgfpathcurveto{\pgfqpoint{1.172cm}{1.902cm}}{\pgfqpoint{1.138cm}{1.887cm}}{\pgfqpoint{1.112cm}{1.862cm}}
\pgfpathcurveto{\pgfqpoint{1.087cm}{1.836cm}}{\pgfqpoint{1.072cm}{1.801cm}}{\pgfqpoint{1.072cm}{1.765cm}}
\pgfpathcurveto{\pgfqpoint{1.072cm}{1.728cm}}{\pgfqpoint{1.087cm}{1.694cm}}{\pgfqpoint{1.112cm}{1.668cm}}
\pgfpathcurveto{\pgfqpoint{1.138cm}{1.642cm}}{\pgfqpoint{1.172cm}{1.628cm}}{\pgfqpoint{1.209cm}{1.628cm}}
\pgfpathcurveto{\pgfqpoint{1.245cm}{1.628cm}}{\pgfqpoint{1.28cm}{1.642cm}}{\pgfqpoint{1.305cm}{1.668cm}}
\pgfpathcurveto{\pgfqpoint{1.331cm}{1.694cm}}{\pgfqpoint{1.345cm}{1.728cm}}{\pgfqpoint{1.345cm}{1.765cm}}
\pgfusepath{fill}
\begin{pgfscope}
\pgfsetdash{}{0cm}
\pgfsetlinewidth{0.818mm}
\pgfsetroundcap
\pgfsetroundjoin
\pgfsetmiterlimit{7.0}
\pgfpathmoveto{\pgfqpoint{0.682cm}{1.065cm}}
\pgfpathlineto{\pgfqpoint{1.246cm}{0.315cm}}
\pgfpathlineto{\pgfqpoint{1.811cm}{1.065cm}}
\pgfusepath{stroke}
\end{pgfscope}
\pgfpathmoveto{\pgfqpoint{1.948cm}{1.065cm}}
\pgfpathcurveto{\pgfqpoint{1.948cm}{1.101cm}}{\pgfqpoint{1.933cm}{1.136cm}}{\pgfqpoint{1.907cm}{1.162cm}}
\pgfpathcurveto{\pgfqpoint{1.882cm}{1.187cm}}{\pgfqpoint{1.847cm}{1.202cm}}{\pgfqpoint{1.811cm}{1.202cm}}
\pgfpathcurveto{\pgfqpoint{1.775cm}{1.202cm}}{\pgfqpoint{1.74cm}{1.187cm}}{\pgfqpoint{1.714cm}{1.162cm}}
\pgfpathcurveto{\pgfqpoint{1.689cm}{1.136cm}}{\pgfqpoint{1.674cm}{1.101cm}}{\pgfqpoint{1.674cm}{1.065cm}}
\pgfpathcurveto{\pgfqpoint{1.674cm}{1.029cm}}{\pgfqpoint{1.689cm}{0.994cm}}{\pgfqpoint{1.714cm}{0.968cm}}
\pgfpathcurveto{\pgfqpoint{1.74cm}{0.942cm}}{\pgfqpoint{1.775cm}{0.928cm}}{\pgfqpoint{1.811cm}{0.928cm}}
\pgfpathcurveto{\pgfqpoint{1.847cm}{0.928cm}}{\pgfqpoint{1.882cm}{0.942cm}}{\pgfqpoint{1.907cm}{0.968cm}}
\pgfpathcurveto{\pgfqpoint{1.933cm}{0.994cm}}{\pgfqpoint{1.948cm}{1.029cm}}{\pgfqpoint{1.948cm}{1.065cm}}
\pgfusepath{fill}
\begin{pgfscope}
\pgfsetdash{}{0cm}
\pgfsetlinewidth{0.818mm}
\pgfsetmiterlimit{4.0}
\pgfpathmoveto{\pgfqpoint{1.383cm}{0.178cm}}
\pgfpathcurveto{\pgfqpoint{1.383cm}{0.214cm}}{\pgfqpoint{1.369cm}{0.249cm}}{\pgfqpoint{1.343cm}{0.275cm}}
\pgfpathcurveto{\pgfqpoint{1.317cm}{0.3cm}}{\pgfqpoint{1.283cm}{0.315cm}}{\pgfqpoint{1.246cm}{0.315cm}}
\pgfpathcurveto{\pgfqpoint{1.21cm}{0.315cm}}{\pgfqpoint{1.175cm}{0.3cm}}{\pgfqpoint{1.15cm}{0.275cm}}
\pgfpathcurveto{\pgfqpoint{1.124cm}{0.249cm}}{\pgfqpoint{1.11cm}{0.214cm}}{\pgfqpoint{1.11cm}{0.178cm}}
\pgfpathcurveto{\pgfqpoint{1.11cm}{0.141cm}}{\pgfqpoint{1.124cm}{0.107cm}}{\pgfqpoint{1.15cm}{0.081cm}}
\pgfpathcurveto{\pgfqpoint{1.175cm}{0.055cm}}{\pgfqpoint{1.21cm}{0.041cm}}{\pgfqpoint{1.246cm}{0.041cm}}
\pgfpathcurveto{\pgfqpoint{1.283cm}{0.041cm}}{\pgfqpoint{1.317cm}{0.055cm}}{\pgfqpoint{1.343cm}{0.081cm}}
\pgfpathcurveto{\pgfqpoint{1.369cm}{0.107cm}}{\pgfqpoint{1.383cm}{0.141cm}}{\pgfqpoint{1.383cm}{0.178cm}}
\pgfusepath{stroke}
\end{pgfscope}
\end{pgfscope}
\end{pgfscope}
\end{pgfscope}
\end{tikzpicture}}}}\\
&\qquad+\rmm{3\UU_>\llbracket X^2 \rrbracket\succ(\phi+\psi)}+\rmm{3\UU_>\llbracket X^2 \rrbracket\prec(\phi+\psi)}+\rmm{3X^{\!\resizebox{0.6em}{!}{
\begin{tikzpicture}
\pgfpathmoveto{\pgfqpoint{0cm}{-0.035cm}}
\pgfpathlineto{\pgfqpoint{1.376cm}{-0.035cm}}
\pgfpathlineto{\pgfqpoint{1.376cm}{1.552cm}}
\pgfpathlineto{\pgfqpoint{0cm}{1.552cm}}
\pgfpathclose
\pgfusepath{clip}
\begin{pgfscope}
\begin{pgfscope}
\pgfpathmoveto{\pgfqpoint{0cm}{-0.035cm}}
\pgfpathlineto{\pgfqpoint{1.376cm}{-0.035cm}}
\pgfpathlineto{\pgfqpoint{1.376cm}{1.552cm}}
\pgfpathlineto{\pgfqpoint{0cm}{1.552cm}}
\pgfpathclose
\pgfusepath{clip}
\begin{pgfscope}
\begin{pgfscope}
\pgfsetdash{}{0cm}
\pgfsetlinewidth{0.818mm}
\pgfsetroundcap
\pgfsetroundjoin
\pgfsetmiterlimit{7.0}
\definecolor{eps2pgf_color}{gray}{0}\pgfsetstrokecolor{eps2pgf_color}\pgfsetfillcolor{eps2pgf_color}
\pgfpathmoveto{\pgfqpoint{0.117cm}{1.421cm}}
\pgfpathlineto{\pgfqpoint{0.682cm}{0.671cm}}
\pgfpathlineto{\pgfqpoint{1.246cm}{1.421cm}}
\pgfusepath{stroke}
\end{pgfscope}
\definecolor{eps2pgf_color}{gray}{0}\pgfsetstrokecolor{eps2pgf_color}\pgfsetfillcolor{eps2pgf_color}
\pgfpathmoveto{\pgfqpoint{0.273cm}{1.395cm}}
\pgfpathcurveto{\pgfqpoint{0.273cm}{1.432cm}}{\pgfqpoint{0.259cm}{1.467cm}}{\pgfqpoint{0.233cm}{1.492cm}}
\pgfpathcurveto{\pgfqpoint{0.207cm}{1.518cm}}{\pgfqpoint{0.173cm}{1.532cm}}{\pgfqpoint{0.137cm}{1.532cm}}
\pgfpathcurveto{\pgfqpoint{0.1cm}{1.532cm}}{\pgfqpoint{0.066cm}{1.518cm}}{\pgfqpoint{0.04cm}{1.492cm}}
\pgfpathcurveto{\pgfqpoint{0.014cm}{1.467cm}}{\pgfqpoint{0cm}{1.432cm}}{\pgfqpoint{0cm}{1.395cm}}
\pgfpathcurveto{\pgfqpoint{0cm}{1.359cm}}{\pgfqpoint{0.014cm}{1.324cm}}{\pgfqpoint{0.04cm}{1.299cm}}
\pgfpathcurveto{\pgfqpoint{0.066cm}{1.273cm}}{\pgfqpoint{0.1cm}{1.258cm}}{\pgfqpoint{0.137cm}{1.258cm}}
\pgfpathcurveto{\pgfqpoint{0.173cm}{1.258cm}}{\pgfqpoint{0.207cm}{1.273cm}}{\pgfqpoint{0.233cm}{1.299cm}}
\pgfpathcurveto{\pgfqpoint{0.259cm}{1.324cm}}{\pgfqpoint{0.273cm}{1.359cm}}{\pgfqpoint{0.273cm}{1.395cm}}
\pgfusepath{fill}
\begin{pgfscope}
\pgfsetdash{}{0cm}
\pgfsetlinewidth{0.818mm}
\pgfsetmiterlimit{7.0}
\pgfpathmoveto{\pgfqpoint{0.682cm}{0.671cm}}
\pgfpathlineto{\pgfqpoint{0.679cm}{1.418cm}}
\pgfusepath{stroke}
\end{pgfscope}
\pgfpathmoveto{\pgfqpoint{0.815cm}{1.399cm}}
\pgfpathcurveto{\pgfqpoint{0.815cm}{1.435cm}}{\pgfqpoint{0.801cm}{1.47cm}}{\pgfqpoint{0.775cm}{1.496cm}}
\pgfpathcurveto{\pgfqpoint{0.75cm}{1.521cm}}{\pgfqpoint{0.715cm}{1.536cm}}{\pgfqpoint{0.679cm}{1.536cm}}
\pgfpathcurveto{\pgfqpoint{0.643cm}{1.536cm}}{\pgfqpoint{0.608cm}{1.521cm}}{\pgfqpoint{0.582cm}{1.496cm}}
\pgfpathcurveto{\pgfqpoint{0.557cm}{1.47cm}}{\pgfqpoint{0.542cm}{1.435cm}}{\pgfqpoint{0.542cm}{1.399cm}}
\pgfpathcurveto{\pgfqpoint{0.542cm}{1.363cm}}{\pgfqpoint{0.557cm}{1.328cm}}{\pgfqpoint{0.582cm}{1.302cm}}
\pgfpathcurveto{\pgfqpoint{0.608cm}{1.276cm}}{\pgfqpoint{0.643cm}{1.262cm}}{\pgfqpoint{0.679cm}{1.262cm}}
\pgfpathcurveto{\pgfqpoint{0.715cm}{1.262cm}}{\pgfqpoint{0.75cm}{1.276cm}}{\pgfqpoint{0.775cm}{1.302cm}}
\pgfpathcurveto{\pgfqpoint{0.801cm}{1.328cm}}{\pgfqpoint{0.815cm}{1.363cm}}{\pgfqpoint{0.815cm}{1.399cm}}
\pgfusepath{fill}
\pgfpathmoveto{\pgfqpoint{1.345cm}{1.371cm}}
\pgfpathcurveto{\pgfqpoint{1.345cm}{1.408cm}}{\pgfqpoint{1.331cm}{1.442cm}}{\pgfqpoint{1.305cm}{1.468cm}}
\pgfpathcurveto{\pgfqpoint{1.28cm}{1.494cm}}{\pgfqpoint{1.245cm}{1.508cm}}{\pgfqpoint{1.209cm}{1.508cm}}
\pgfpathcurveto{\pgfqpoint{1.172cm}{1.508cm}}{\pgfqpoint{1.138cm}{1.494cm}}{\pgfqpoint{1.112cm}{1.468cm}}
\pgfpathcurveto{\pgfqpoint{1.087cm}{1.442cm}}{\pgfqpoint{1.072cm}{1.408cm}}{\pgfqpoint{1.072cm}{1.371cm}}
\pgfpathcurveto{\pgfqpoint{1.072cm}{1.335cm}}{\pgfqpoint{1.087cm}{1.3cm}}{\pgfqpoint{1.112cm}{1.274cm}}
\pgfpathcurveto{\pgfqpoint{1.138cm}{1.249cm}}{\pgfqpoint{1.172cm}{1.234cm}}{\pgfqpoint{1.209cm}{1.234cm}}
\pgfpathcurveto{\pgfqpoint{1.245cm}{1.234cm}}{\pgfqpoint{1.28cm}{1.249cm}}{\pgfqpoint{1.305cm}{1.274cm}}
\pgfpathcurveto{\pgfqpoint{1.331cm}{1.3cm}}{\pgfqpoint{1.345cm}{1.335cm}}{\pgfqpoint{1.345cm}{1.371cm}}
\pgfusepath{fill}
\begin{pgfscope}
\pgfsetdash{}{0cm}
\pgfsetlinewidth{0.818mm}
\pgfsetroundcap
\pgfsetmiterlimit{4.0}
\pgfpathmoveto{\pgfqpoint{0.682cm}{0.671cm}}
\pgfpathlineto{\pgfqpoint{0.682cm}{0.042cm}}
\pgfusepath{stroke}
\end{pgfscope}
\end{pgfscope}
\end{pgfscope}
\end{pgfscope}
\end{tikzpicture}}}X^{\!\resizebox{!}{.8em}{
\begin{tikzpicture}
\pgfpathmoveto{\pgfqpoint{0cm}{-0.035cm}}
\pgfpathlineto{\pgfqpoint{1.976cm}{-0.035cm}}
\pgfpathlineto{\pgfqpoint{1.976cm}{1.94cm}}
\pgfpathlineto{\pgfqpoint{0cm}{1.94cm}}
\pgfpathclose
\pgfusepath{clip}
\begin{pgfscope}
\begin{pgfscope}
\pgfpathmoveto{\pgfqpoint{0cm}{-0.035cm}}
\pgfpathlineto{\pgfqpoint{1.976cm}{-0.035cm}}
\pgfpathlineto{\pgfqpoint{1.976cm}{1.94cm}}
\pgfpathlineto{\pgfqpoint{0cm}{1.94cm}}
\pgfpathclose
\pgfusepath{clip}
\begin{pgfscope}
\begin{pgfscope}
\pgfsetdash{}{0cm}
\pgfsetlinewidth{0.818mm}
\pgfsetroundcap
\pgfsetroundjoin
\pgfsetmiterlimit{7.0}
\definecolor{eps2pgf_color}{gray}{0}\pgfsetstrokecolor{eps2pgf_color}\pgfsetfillcolor{eps2pgf_color}
\pgfpathmoveto{\pgfqpoint{0.117cm}{1.815cm}}
\pgfpathlineto{\pgfqpoint{0.682cm}{1.065cm}}
\pgfpathlineto{\pgfqpoint{1.246cm}{1.815cm}}
\pgfusepath{stroke}
\end{pgfscope}
\definecolor{eps2pgf_color}{gray}{0}\pgfsetstrokecolor{eps2pgf_color}\pgfsetfillcolor{eps2pgf_color}
\pgfpathmoveto{\pgfqpoint{0.273cm}{1.789cm}}
\pgfpathcurveto{\pgfqpoint{0.273cm}{1.825cm}}{\pgfqpoint{0.259cm}{1.86cm}}{\pgfqpoint{0.233cm}{1.886cm}}
\pgfpathcurveto{\pgfqpoint{0.207cm}{1.912cm}}{\pgfqpoint{0.173cm}{1.926cm}}{\pgfqpoint{0.137cm}{1.926cm}}
\pgfpathcurveto{\pgfqpoint{0.1cm}{1.926cm}}{\pgfqpoint{0.066cm}{1.912cm}}{\pgfqpoint{0.04cm}{1.886cm}}
\pgfpathcurveto{\pgfqpoint{0.014cm}{1.86cm}}{\pgfqpoint{0cm}{1.825cm}}{\pgfqpoint{0cm}{1.789cm}}
\pgfpathcurveto{\pgfqpoint{0cm}{1.753cm}}{\pgfqpoint{0.014cm}{1.718cm}}{\pgfqpoint{0.04cm}{1.692cm}}
\pgfpathcurveto{\pgfqpoint{0.066cm}{1.667cm}}{\pgfqpoint{0.1cm}{1.652cm}}{\pgfqpoint{0.137cm}{1.652cm}}
\pgfpathcurveto{\pgfqpoint{0.173cm}{1.652cm}}{\pgfqpoint{0.207cm}{1.667cm}}{\pgfqpoint{0.233cm}{1.692cm}}
\pgfpathcurveto{\pgfqpoint{0.259cm}{1.718cm}}{\pgfqpoint{0.273cm}{1.753cm}}{\pgfqpoint{0.273cm}{1.789cm}}
\pgfusepath{fill}
\pgfpathmoveto{\pgfqpoint{1.345cm}{1.765cm}}
\pgfpathcurveto{\pgfqpoint{1.345cm}{1.801cm}}{\pgfqpoint{1.331cm}{1.836cm}}{\pgfqpoint{1.305cm}{1.862cm}}
\pgfpathcurveto{\pgfqpoint{1.28cm}{1.887cm}}{\pgfqpoint{1.245cm}{1.902cm}}{\pgfqpoint{1.209cm}{1.902cm}}
\pgfpathcurveto{\pgfqpoint{1.172cm}{1.902cm}}{\pgfqpoint{1.138cm}{1.887cm}}{\pgfqpoint{1.112cm}{1.862cm}}
\pgfpathcurveto{\pgfqpoint{1.087cm}{1.836cm}}{\pgfqpoint{1.072cm}{1.801cm}}{\pgfqpoint{1.072cm}{1.765cm}}
\pgfpathcurveto{\pgfqpoint{1.072cm}{1.728cm}}{\pgfqpoint{1.087cm}{1.694cm}}{\pgfqpoint{1.112cm}{1.668cm}}
\pgfpathcurveto{\pgfqpoint{1.138cm}{1.642cm}}{\pgfqpoint{1.172cm}{1.628cm}}{\pgfqpoint{1.209cm}{1.628cm}}
\pgfpathcurveto{\pgfqpoint{1.245cm}{1.628cm}}{\pgfqpoint{1.28cm}{1.642cm}}{\pgfqpoint{1.305cm}{1.668cm}}
\pgfpathcurveto{\pgfqpoint{1.331cm}{1.694cm}}{\pgfqpoint{1.345cm}{1.728cm}}{\pgfqpoint{1.345cm}{1.765cm}}
\pgfusepath{fill}
\begin{pgfscope}
\pgfsetdash{}{0cm}
\pgfsetlinewidth{0.818mm}
\pgfsetroundcap
\pgfsetroundjoin
\pgfsetmiterlimit{7.0}
\pgfpathmoveto{\pgfqpoint{0.682cm}{1.065cm}}
\pgfpathlineto{\pgfqpoint{1.246cm}{0.315cm}}
\pgfpathlineto{\pgfqpoint{1.811cm}{1.065cm}}
\pgfusepath{stroke}
\end{pgfscope}
\pgfpathmoveto{\pgfqpoint{1.948cm}{1.065cm}}
\pgfpathcurveto{\pgfqpoint{1.948cm}{1.101cm}}{\pgfqpoint{1.933cm}{1.136cm}}{\pgfqpoint{1.907cm}{1.162cm}}
\pgfpathcurveto{\pgfqpoint{1.882cm}{1.187cm}}{\pgfqpoint{1.847cm}{1.202cm}}{\pgfqpoint{1.811cm}{1.202cm}}
\pgfpathcurveto{\pgfqpoint{1.775cm}{1.202cm}}{\pgfqpoint{1.74cm}{1.187cm}}{\pgfqpoint{1.714cm}{1.162cm}}
\pgfpathcurveto{\pgfqpoint{1.689cm}{1.136cm}}{\pgfqpoint{1.674cm}{1.101cm}}{\pgfqpoint{1.674cm}{1.065cm}}
\pgfpathcurveto{\pgfqpoint{1.674cm}{1.029cm}}{\pgfqpoint{1.689cm}{0.994cm}}{\pgfqpoint{1.714cm}{0.968cm}}
\pgfpathcurveto{\pgfqpoint{1.74cm}{0.942cm}}{\pgfqpoint{1.775cm}{0.928cm}}{\pgfqpoint{1.811cm}{0.928cm}}
\pgfpathcurveto{\pgfqpoint{1.847cm}{0.928cm}}{\pgfqpoint{1.882cm}{0.942cm}}{\pgfqpoint{1.907cm}{0.968cm}}
\pgfpathcurveto{\pgfqpoint{1.933cm}{0.994cm}}{\pgfqpoint{1.948cm}{1.029cm}}{\pgfqpoint{1.948cm}{1.065cm}}
\pgfusepath{fill}
\begin{pgfscope}
\pgfsetdash{}{0cm}
\pgfsetlinewidth{0.818mm}
\pgfsetmiterlimit{7.0}
\pgfpathmoveto{\pgfqpoint{1.246cm}{0.315cm}}
\pgfpathlineto{\pgfqpoint{1.244cm}{1.061cm}}
\pgfusepath{stroke}
\end{pgfscope}
\pgfpathmoveto{\pgfqpoint{1.38cm}{1.065cm}}
\pgfpathcurveto{\pgfqpoint{1.38cm}{1.101cm}}{\pgfqpoint{1.366cm}{1.136cm}}{\pgfqpoint{1.34cm}{1.162cm}}
\pgfpathcurveto{\pgfqpoint{1.315cm}{1.187cm}}{\pgfqpoint{1.28cm}{1.202cm}}{\pgfqpoint{1.244cm}{1.202cm}}
\pgfpathcurveto{\pgfqpoint{1.207cm}{1.202cm}}{\pgfqpoint{1.173cm}{1.187cm}}{\pgfqpoint{1.147cm}{1.162cm}}
\pgfpathcurveto{\pgfqpoint{1.121cm}{1.136cm}}{\pgfqpoint{1.107cm}{1.101cm}}{\pgfqpoint{1.107cm}{1.065cm}}
\pgfpathcurveto{\pgfqpoint{1.107cm}{1.029cm}}{\pgfqpoint{1.121cm}{0.994cm}}{\pgfqpoint{1.147cm}{0.968cm}}
\pgfpathcurveto{\pgfqpoint{1.173cm}{0.942cm}}{\pgfqpoint{1.207cm}{0.928cm}}{\pgfqpoint{1.244cm}{0.928cm}}
\pgfpathcurveto{\pgfqpoint{1.28cm}{0.928cm}}{\pgfqpoint{1.315cm}{0.942cm}}{\pgfqpoint{1.34cm}{0.968cm}}
\pgfpathcurveto{\pgfqpoint{1.366cm}{0.994cm}}{\pgfqpoint{1.38cm}{1.029cm}}{\pgfqpoint{1.38cm}{1.065cm}}
\pgfusepath{fill}
\begin{pgfscope}
\pgfsetdash{}{0cm}
\pgfsetlinewidth{0.818mm}
\pgfsetmiterlimit{4.0}
\pgfpathmoveto{\pgfqpoint{1.383cm}{0.178cm}}
\pgfpathcurveto{\pgfqpoint{1.383cm}{0.214cm}}{\pgfqpoint{1.369cm}{0.249cm}}{\pgfqpoint{1.343cm}{0.275cm}}
\pgfpathcurveto{\pgfqpoint{1.317cm}{0.3cm}}{\pgfqpoint{1.283cm}{0.315cm}}{\pgfqpoint{1.246cm}{0.315cm}}
\pgfpathcurveto{\pgfqpoint{1.21cm}{0.315cm}}{\pgfqpoint{1.175cm}{0.3cm}}{\pgfqpoint{1.15cm}{0.275cm}}
\pgfpathcurveto{\pgfqpoint{1.124cm}{0.249cm}}{\pgfqpoint{1.11cm}{0.214cm}}{\pgfqpoint{1.11cm}{0.178cm}}
\pgfpathcurveto{\pgfqpoint{1.11cm}{0.141cm}}{\pgfqpoint{1.124cm}{0.107cm}}{\pgfqpoint{1.15cm}{0.081cm}}
\pgfpathcurveto{\pgfqpoint{1.175cm}{0.055cm}}{\pgfqpoint{1.21cm}{0.041cm}}{\pgfqpoint{1.246cm}{0.041cm}}
\pgfpathcurveto{\pgfqpoint{1.283cm}{0.041cm}}{\pgfqpoint{1.317cm}{0.055cm}}{\pgfqpoint{1.343cm}{0.081cm}}
\pgfpathcurveto{\pgfqpoint{1.369cm}{0.107cm}}{\pgfqpoint{1.383cm}{0.141cm}}{\pgfqpoint{1.383cm}{0.178cm}}
\pgfusepath{stroke}
\end{pgfscope}
\end{pgfscope}
\end{pgfscope}
\end{pgfscope}
\end{tikzpicture}}}}-\rmm{3( \phi + \psi)\prec\UU_>X^{\!\resizebox{!}{.8em}{
\begin{tikzpicture}
\pgfpathmoveto{\pgfqpoint{0cm}{-0.035cm}}
\pgfpathlineto{\pgfqpoint{1.976cm}{-0.035cm}}
\pgfpathlineto{\pgfqpoint{1.976cm}{1.94cm}}
\pgfpathlineto{\pgfqpoint{0cm}{1.94cm}}
\pgfpathclose
\pgfusepath{clip}
\begin{pgfscope}
\begin{pgfscope}
\pgfpathmoveto{\pgfqpoint{0cm}{-0.035cm}}
\pgfpathlineto{\pgfqpoint{1.976cm}{-0.035cm}}
\pgfpathlineto{\pgfqpoint{1.976cm}{1.94cm}}
\pgfpathlineto{\pgfqpoint{0cm}{1.94cm}}
\pgfpathclose
\pgfusepath{clip}
\begin{pgfscope}
\begin{pgfscope}
\pgfsetdash{}{0cm}
\pgfsetlinewidth{0.818mm}
\pgfsetroundcap
\pgfsetroundjoin
\pgfsetmiterlimit{7.0}
\definecolor{eps2pgf_color}{gray}{0}\pgfsetstrokecolor{eps2pgf_color}\pgfsetfillcolor{eps2pgf_color}
\pgfpathmoveto{\pgfqpoint{0.117cm}{1.815cm}}
\pgfpathlineto{\pgfqpoint{0.682cm}{1.065cm}}
\pgfpathlineto{\pgfqpoint{1.246cm}{1.815cm}}
\pgfusepath{stroke}
\end{pgfscope}
\definecolor{eps2pgf_color}{gray}{0}\pgfsetstrokecolor{eps2pgf_color}\pgfsetfillcolor{eps2pgf_color}
\pgfpathmoveto{\pgfqpoint{0.273cm}{1.789cm}}
\pgfpathcurveto{\pgfqpoint{0.273cm}{1.825cm}}{\pgfqpoint{0.259cm}{1.86cm}}{\pgfqpoint{0.233cm}{1.886cm}}
\pgfpathcurveto{\pgfqpoint{0.207cm}{1.912cm}}{\pgfqpoint{0.173cm}{1.926cm}}{\pgfqpoint{0.137cm}{1.926cm}}
\pgfpathcurveto{\pgfqpoint{0.1cm}{1.926cm}}{\pgfqpoint{0.066cm}{1.912cm}}{\pgfqpoint{0.04cm}{1.886cm}}
\pgfpathcurveto{\pgfqpoint{0.014cm}{1.86cm}}{\pgfqpoint{0cm}{1.825cm}}{\pgfqpoint{0cm}{1.789cm}}
\pgfpathcurveto{\pgfqpoint{0cm}{1.753cm}}{\pgfqpoint{0.014cm}{1.718cm}}{\pgfqpoint{0.04cm}{1.692cm}}
\pgfpathcurveto{\pgfqpoint{0.066cm}{1.667cm}}{\pgfqpoint{0.1cm}{1.652cm}}{\pgfqpoint{0.137cm}{1.652cm}}
\pgfpathcurveto{\pgfqpoint{0.173cm}{1.652cm}}{\pgfqpoint{0.207cm}{1.667cm}}{\pgfqpoint{0.233cm}{1.692cm}}
\pgfpathcurveto{\pgfqpoint{0.259cm}{1.718cm}}{\pgfqpoint{0.273cm}{1.753cm}}{\pgfqpoint{0.273cm}{1.789cm}}
\pgfusepath{fill}
\pgfpathmoveto{\pgfqpoint{1.345cm}{1.765cm}}
\pgfpathcurveto{\pgfqpoint{1.345cm}{1.801cm}}{\pgfqpoint{1.331cm}{1.836cm}}{\pgfqpoint{1.305cm}{1.862cm}}
\pgfpathcurveto{\pgfqpoint{1.28cm}{1.887cm}}{\pgfqpoint{1.245cm}{1.902cm}}{\pgfqpoint{1.209cm}{1.902cm}}
\pgfpathcurveto{\pgfqpoint{1.172cm}{1.902cm}}{\pgfqpoint{1.138cm}{1.887cm}}{\pgfqpoint{1.112cm}{1.862cm}}
\pgfpathcurveto{\pgfqpoint{1.087cm}{1.836cm}}{\pgfqpoint{1.072cm}{1.801cm}}{\pgfqpoint{1.072cm}{1.765cm}}
\pgfpathcurveto{\pgfqpoint{1.072cm}{1.728cm}}{\pgfqpoint{1.087cm}{1.694cm}}{\pgfqpoint{1.112cm}{1.668cm}}
\pgfpathcurveto{\pgfqpoint{1.138cm}{1.642cm}}{\pgfqpoint{1.172cm}{1.628cm}}{\pgfqpoint{1.209cm}{1.628cm}}
\pgfpathcurveto{\pgfqpoint{1.245cm}{1.628cm}}{\pgfqpoint{1.28cm}{1.642cm}}{\pgfqpoint{1.305cm}{1.668cm}}
\pgfpathcurveto{\pgfqpoint{1.331cm}{1.694cm}}{\pgfqpoint{1.345cm}{1.728cm}}{\pgfqpoint{1.345cm}{1.765cm}}
\pgfusepath{fill}
\begin{pgfscope}
\pgfsetdash{}{0cm}
\pgfsetlinewidth{0.818mm}
\pgfsetroundcap
\pgfsetroundjoin
\pgfsetmiterlimit{7.0}
\pgfpathmoveto{\pgfqpoint{0.682cm}{1.065cm}}
\pgfpathlineto{\pgfqpoint{1.246cm}{0.315cm}}
\pgfpathlineto{\pgfqpoint{1.811cm}{1.065cm}}
\pgfusepath{stroke}
\end{pgfscope}
\pgfpathmoveto{\pgfqpoint{1.948cm}{1.065cm}}
\pgfpathcurveto{\pgfqpoint{1.948cm}{1.101cm}}{\pgfqpoint{1.933cm}{1.136cm}}{\pgfqpoint{1.907cm}{1.162cm}}
\pgfpathcurveto{\pgfqpoint{1.882cm}{1.187cm}}{\pgfqpoint{1.847cm}{1.202cm}}{\pgfqpoint{1.811cm}{1.202cm}}
\pgfpathcurveto{\pgfqpoint{1.775cm}{1.202cm}}{\pgfqpoint{1.74cm}{1.187cm}}{\pgfqpoint{1.714cm}{1.162cm}}
\pgfpathcurveto{\pgfqpoint{1.689cm}{1.136cm}}{\pgfqpoint{1.674cm}{1.101cm}}{\pgfqpoint{1.674cm}{1.065cm}}
\pgfpathcurveto{\pgfqpoint{1.674cm}{1.029cm}}{\pgfqpoint{1.689cm}{0.994cm}}{\pgfqpoint{1.714cm}{0.968cm}}
\pgfpathcurveto{\pgfqpoint{1.74cm}{0.942cm}}{\pgfqpoint{1.775cm}{0.928cm}}{\pgfqpoint{1.811cm}{0.928cm}}
\pgfpathcurveto{\pgfqpoint{1.847cm}{0.928cm}}{\pgfqpoint{1.882cm}{0.942cm}}{\pgfqpoint{1.907cm}{0.968cm}}
\pgfpathcurveto{\pgfqpoint{1.933cm}{0.994cm}}{\pgfqpoint{1.948cm}{1.029cm}}{\pgfqpoint{1.948cm}{1.065cm}}
\pgfusepath{fill}
\begin{pgfscope}
\pgfsetdash{}{0cm}
\pgfsetlinewidth{0.818mm}
\pgfsetmiterlimit{7.0}
\pgfpathmoveto{\pgfqpoint{1.246cm}{0.315cm}}
\pgfpathlineto{\pgfqpoint{1.244cm}{1.061cm}}
\pgfusepath{stroke}
\end{pgfscope}
\pgfpathmoveto{\pgfqpoint{1.38cm}{1.065cm}}
\pgfpathcurveto{\pgfqpoint{1.38cm}{1.101cm}}{\pgfqpoint{1.366cm}{1.136cm}}{\pgfqpoint{1.34cm}{1.162cm}}
\pgfpathcurveto{\pgfqpoint{1.315cm}{1.187cm}}{\pgfqpoint{1.28cm}{1.202cm}}{\pgfqpoint{1.244cm}{1.202cm}}
\pgfpathcurveto{\pgfqpoint{1.207cm}{1.202cm}}{\pgfqpoint{1.173cm}{1.187cm}}{\pgfqpoint{1.147cm}{1.162cm}}
\pgfpathcurveto{\pgfqpoint{1.121cm}{1.136cm}}{\pgfqpoint{1.107cm}{1.101cm}}{\pgfqpoint{1.107cm}{1.065cm}}
\pgfpathcurveto{\pgfqpoint{1.107cm}{1.029cm}}{\pgfqpoint{1.121cm}{0.994cm}}{\pgfqpoint{1.147cm}{0.968cm}}
\pgfpathcurveto{\pgfqpoint{1.173cm}{0.942cm}}{\pgfqpoint{1.207cm}{0.928cm}}{\pgfqpoint{1.244cm}{0.928cm}}
\pgfpathcurveto{\pgfqpoint{1.28cm}{0.928cm}}{\pgfqpoint{1.315cm}{0.942cm}}{\pgfqpoint{1.34cm}{0.968cm}}
\pgfpathcurveto{\pgfqpoint{1.366cm}{0.994cm}}{\pgfqpoint{1.38cm}{1.029cm}}{\pgfqpoint{1.38cm}{1.065cm}}
\pgfusepath{fill}
\begin{pgfscope}
\pgfsetdash{}{0cm}
\pgfsetlinewidth{0.818mm}
\pgfsetmiterlimit{4.0}
\pgfpathmoveto{\pgfqpoint{1.383cm}{0.178cm}}
\pgfpathcurveto{\pgfqpoint{1.383cm}{0.214cm}}{\pgfqpoint{1.369cm}{0.249cm}}{\pgfqpoint{1.343cm}{0.275cm}}
\pgfpathcurveto{\pgfqpoint{1.317cm}{0.3cm}}{\pgfqpoint{1.283cm}{0.315cm}}{\pgfqpoint{1.246cm}{0.315cm}}
\pgfpathcurveto{\pgfqpoint{1.21cm}{0.315cm}}{\pgfqpoint{1.175cm}{0.3cm}}{\pgfqpoint{1.15cm}{0.275cm}}
\pgfpathcurveto{\pgfqpoint{1.124cm}{0.249cm}}{\pgfqpoint{1.11cm}{0.214cm}}{\pgfqpoint{1.11cm}{0.178cm}}
\pgfpathcurveto{\pgfqpoint{1.11cm}{0.141cm}}{\pgfqpoint{1.124cm}{0.107cm}}{\pgfqpoint{1.15cm}{0.081cm}}
\pgfpathcurveto{\pgfqpoint{1.175cm}{0.055cm}}{\pgfqpoint{1.21cm}{0.041cm}}{\pgfqpoint{1.246cm}{0.041cm}}
\pgfpathcurveto{\pgfqpoint{1.283cm}{0.041cm}}{\pgfqpoint{1.317cm}{0.055cm}}{\pgfqpoint{1.343cm}{0.081cm}}
\pgfpathcurveto{\pgfqpoint{1.369cm}{0.107cm}}{\pgfqpoint{1.383cm}{0.141cm}}{\pgfqpoint{1.383cm}{0.178cm}}
\pgfusepath{stroke}
\end{pgfscope}
\end{pgfscope}
\end{pgfscope}
\end{pgfscope}
\end{tikzpicture}}}}\\
&\qquad+\rmm{6\UU_> X\prec(X^{\!\resizebox{0.6em}{!}{
\begin{tikzpicture}
\pgfpathmoveto{\pgfqpoint{0cm}{-0.035cm}}
\pgfpathlineto{\pgfqpoint{1.376cm}{-0.035cm}}
\pgfpathlineto{\pgfqpoint{1.376cm}{1.552cm}}
\pgfpathlineto{\pgfqpoint{0cm}{1.552cm}}
\pgfpathclose
\pgfusepath{clip}
\begin{pgfscope}
\begin{pgfscope}
\pgfpathmoveto{\pgfqpoint{0cm}{-0.035cm}}
\pgfpathlineto{\pgfqpoint{1.376cm}{-0.035cm}}
\pgfpathlineto{\pgfqpoint{1.376cm}{1.552cm}}
\pgfpathlineto{\pgfqpoint{0cm}{1.552cm}}
\pgfpathclose
\pgfusepath{clip}
\begin{pgfscope}
\begin{pgfscope}
\pgfsetdash{}{0cm}
\pgfsetlinewidth{0.818mm}
\pgfsetroundcap
\pgfsetroundjoin
\pgfsetmiterlimit{7.0}
\definecolor{eps2pgf_color}{gray}{0}\pgfsetstrokecolor{eps2pgf_color}\pgfsetfillcolor{eps2pgf_color}
\pgfpathmoveto{\pgfqpoint{0.117cm}{1.421cm}}
\pgfpathlineto{\pgfqpoint{0.682cm}{0.671cm}}
\pgfpathlineto{\pgfqpoint{1.246cm}{1.421cm}}
\pgfusepath{stroke}
\end{pgfscope}
\definecolor{eps2pgf_color}{gray}{0}\pgfsetstrokecolor{eps2pgf_color}\pgfsetfillcolor{eps2pgf_color}
\pgfpathmoveto{\pgfqpoint{0.273cm}{1.395cm}}
\pgfpathcurveto{\pgfqpoint{0.273cm}{1.432cm}}{\pgfqpoint{0.259cm}{1.467cm}}{\pgfqpoint{0.233cm}{1.492cm}}
\pgfpathcurveto{\pgfqpoint{0.207cm}{1.518cm}}{\pgfqpoint{0.173cm}{1.532cm}}{\pgfqpoint{0.137cm}{1.532cm}}
\pgfpathcurveto{\pgfqpoint{0.1cm}{1.532cm}}{\pgfqpoint{0.066cm}{1.518cm}}{\pgfqpoint{0.04cm}{1.492cm}}
\pgfpathcurveto{\pgfqpoint{0.014cm}{1.467cm}}{\pgfqpoint{0cm}{1.432cm}}{\pgfqpoint{0cm}{1.395cm}}
\pgfpathcurveto{\pgfqpoint{0cm}{1.359cm}}{\pgfqpoint{0.014cm}{1.324cm}}{\pgfqpoint{0.04cm}{1.299cm}}
\pgfpathcurveto{\pgfqpoint{0.066cm}{1.273cm}}{\pgfqpoint{0.1cm}{1.258cm}}{\pgfqpoint{0.137cm}{1.258cm}}
\pgfpathcurveto{\pgfqpoint{0.173cm}{1.258cm}}{\pgfqpoint{0.207cm}{1.273cm}}{\pgfqpoint{0.233cm}{1.299cm}}
\pgfpathcurveto{\pgfqpoint{0.259cm}{1.324cm}}{\pgfqpoint{0.273cm}{1.359cm}}{\pgfqpoint{0.273cm}{1.395cm}}
\pgfusepath{fill}
\begin{pgfscope}
\pgfsetdash{}{0cm}
\pgfsetlinewidth{0.818mm}
\pgfsetmiterlimit{7.0}
\pgfpathmoveto{\pgfqpoint{0.682cm}{0.671cm}}
\pgfpathlineto{\pgfqpoint{0.679cm}{1.418cm}}
\pgfusepath{stroke}
\end{pgfscope}
\pgfpathmoveto{\pgfqpoint{0.815cm}{1.399cm}}
\pgfpathcurveto{\pgfqpoint{0.815cm}{1.435cm}}{\pgfqpoint{0.801cm}{1.47cm}}{\pgfqpoint{0.775cm}{1.496cm}}
\pgfpathcurveto{\pgfqpoint{0.75cm}{1.521cm}}{\pgfqpoint{0.715cm}{1.536cm}}{\pgfqpoint{0.679cm}{1.536cm}}
\pgfpathcurveto{\pgfqpoint{0.643cm}{1.536cm}}{\pgfqpoint{0.608cm}{1.521cm}}{\pgfqpoint{0.582cm}{1.496cm}}
\pgfpathcurveto{\pgfqpoint{0.557cm}{1.47cm}}{\pgfqpoint{0.542cm}{1.435cm}}{\pgfqpoint{0.542cm}{1.399cm}}
\pgfpathcurveto{\pgfqpoint{0.542cm}{1.363cm}}{\pgfqpoint{0.557cm}{1.328cm}}{\pgfqpoint{0.582cm}{1.302cm}}
\pgfpathcurveto{\pgfqpoint{0.608cm}{1.276cm}}{\pgfqpoint{0.643cm}{1.262cm}}{\pgfqpoint{0.679cm}{1.262cm}}
\pgfpathcurveto{\pgfqpoint{0.715cm}{1.262cm}}{\pgfqpoint{0.75cm}{1.276cm}}{\pgfqpoint{0.775cm}{1.302cm}}
\pgfpathcurveto{\pgfqpoint{0.801cm}{1.328cm}}{\pgfqpoint{0.815cm}{1.363cm}}{\pgfqpoint{0.815cm}{1.399cm}}
\pgfusepath{fill}
\pgfpathmoveto{\pgfqpoint{1.345cm}{1.371cm}}
\pgfpathcurveto{\pgfqpoint{1.345cm}{1.408cm}}{\pgfqpoint{1.331cm}{1.442cm}}{\pgfqpoint{1.305cm}{1.468cm}}
\pgfpathcurveto{\pgfqpoint{1.28cm}{1.494cm}}{\pgfqpoint{1.245cm}{1.508cm}}{\pgfqpoint{1.209cm}{1.508cm}}
\pgfpathcurveto{\pgfqpoint{1.172cm}{1.508cm}}{\pgfqpoint{1.138cm}{1.494cm}}{\pgfqpoint{1.112cm}{1.468cm}}
\pgfpathcurveto{\pgfqpoint{1.087cm}{1.442cm}}{\pgfqpoint{1.072cm}{1.408cm}}{\pgfqpoint{1.072cm}{1.371cm}}
\pgfpathcurveto{\pgfqpoint{1.072cm}{1.335cm}}{\pgfqpoint{1.087cm}{1.3cm}}{\pgfqpoint{1.112cm}{1.274cm}}
\pgfpathcurveto{\pgfqpoint{1.138cm}{1.249cm}}{\pgfqpoint{1.172cm}{1.234cm}}{\pgfqpoint{1.209cm}{1.234cm}}
\pgfpathcurveto{\pgfqpoint{1.245cm}{1.234cm}}{\pgfqpoint{1.28cm}{1.249cm}}{\pgfqpoint{1.305cm}{1.274cm}}
\pgfpathcurveto{\pgfqpoint{1.331cm}{1.3cm}}{\pgfqpoint{1.345cm}{1.335cm}}{\pgfqpoint{1.345cm}{1.371cm}}
\pgfusepath{fill}
\begin{pgfscope}
\pgfsetdash{}{0cm}
\pgfsetlinewidth{0.818mm}
\pgfsetroundcap
\pgfsetmiterlimit{4.0}
\pgfpathmoveto{\pgfqpoint{0.682cm}{0.671cm}}
\pgfpathlineto{\pgfqpoint{0.682cm}{0.042cm}}
\pgfusepath{stroke}
\end{pgfscope}
\end{pgfscope}
\end{pgfscope}
\end{pgfscope}
\end{tikzpicture}}}(\phi+\psi))}+\rmm{6\UU_> X\prec(X^{\!\resizebox{0.6em}{!}{
\begin{tikzpicture}
\pgfpathmoveto{\pgfqpoint{0cm}{-0.035cm}}
\pgfpathlineto{\pgfqpoint{1.376cm}{-0.035cm}}
\pgfpathlineto{\pgfqpoint{1.376cm}{1.552cm}}
\pgfpathlineto{\pgfqpoint{0cm}{1.552cm}}
\pgfpathclose
\pgfusepath{clip}
\begin{pgfscope}
\begin{pgfscope}
\pgfpathmoveto{\pgfqpoint{0cm}{-0.035cm}}
\pgfpathlineto{\pgfqpoint{1.376cm}{-0.035cm}}
\pgfpathlineto{\pgfqpoint{1.376cm}{1.552cm}}
\pgfpathlineto{\pgfqpoint{0cm}{1.552cm}}
\pgfpathclose
\pgfusepath{clip}
\begin{pgfscope}
\begin{pgfscope}
\pgfsetdash{}{0cm}
\pgfsetlinewidth{0.818mm}
\pgfsetroundcap
\pgfsetroundjoin
\pgfsetmiterlimit{7.0}
\definecolor{eps2pgf_color}{gray}{0}\pgfsetstrokecolor{eps2pgf_color}\pgfsetfillcolor{eps2pgf_color}
\pgfpathmoveto{\pgfqpoint{0.117cm}{1.421cm}}
\pgfpathlineto{\pgfqpoint{0.682cm}{0.671cm}}
\pgfpathlineto{\pgfqpoint{1.246cm}{1.421cm}}
\pgfusepath{stroke}
\end{pgfscope}
\definecolor{eps2pgf_color}{gray}{0}\pgfsetstrokecolor{eps2pgf_color}\pgfsetfillcolor{eps2pgf_color}
\pgfpathmoveto{\pgfqpoint{0.273cm}{1.395cm}}
\pgfpathcurveto{\pgfqpoint{0.273cm}{1.432cm}}{\pgfqpoint{0.259cm}{1.467cm}}{\pgfqpoint{0.233cm}{1.492cm}}
\pgfpathcurveto{\pgfqpoint{0.207cm}{1.518cm}}{\pgfqpoint{0.173cm}{1.532cm}}{\pgfqpoint{0.137cm}{1.532cm}}
\pgfpathcurveto{\pgfqpoint{0.1cm}{1.532cm}}{\pgfqpoint{0.066cm}{1.518cm}}{\pgfqpoint{0.04cm}{1.492cm}}
\pgfpathcurveto{\pgfqpoint{0.014cm}{1.467cm}}{\pgfqpoint{0cm}{1.432cm}}{\pgfqpoint{0cm}{1.395cm}}
\pgfpathcurveto{\pgfqpoint{0cm}{1.359cm}}{\pgfqpoint{0.014cm}{1.324cm}}{\pgfqpoint{0.04cm}{1.299cm}}
\pgfpathcurveto{\pgfqpoint{0.066cm}{1.273cm}}{\pgfqpoint{0.1cm}{1.258cm}}{\pgfqpoint{0.137cm}{1.258cm}}
\pgfpathcurveto{\pgfqpoint{0.173cm}{1.258cm}}{\pgfqpoint{0.207cm}{1.273cm}}{\pgfqpoint{0.233cm}{1.299cm}}
\pgfpathcurveto{\pgfqpoint{0.259cm}{1.324cm}}{\pgfqpoint{0.273cm}{1.359cm}}{\pgfqpoint{0.273cm}{1.395cm}}
\pgfusepath{fill}
\begin{pgfscope}
\pgfsetdash{}{0cm}
\pgfsetlinewidth{0.818mm}
\pgfsetmiterlimit{7.0}
\pgfpathmoveto{\pgfqpoint{0.682cm}{0.671cm}}
\pgfpathlineto{\pgfqpoint{0.679cm}{1.418cm}}
\pgfusepath{stroke}
\end{pgfscope}
\pgfpathmoveto{\pgfqpoint{0.815cm}{1.399cm}}
\pgfpathcurveto{\pgfqpoint{0.815cm}{1.435cm}}{\pgfqpoint{0.801cm}{1.47cm}}{\pgfqpoint{0.775cm}{1.496cm}}
\pgfpathcurveto{\pgfqpoint{0.75cm}{1.521cm}}{\pgfqpoint{0.715cm}{1.536cm}}{\pgfqpoint{0.679cm}{1.536cm}}
\pgfpathcurveto{\pgfqpoint{0.643cm}{1.536cm}}{\pgfqpoint{0.608cm}{1.521cm}}{\pgfqpoint{0.582cm}{1.496cm}}
\pgfpathcurveto{\pgfqpoint{0.557cm}{1.47cm}}{\pgfqpoint{0.542cm}{1.435cm}}{\pgfqpoint{0.542cm}{1.399cm}}
\pgfpathcurveto{\pgfqpoint{0.542cm}{1.363cm}}{\pgfqpoint{0.557cm}{1.328cm}}{\pgfqpoint{0.582cm}{1.302cm}}
\pgfpathcurveto{\pgfqpoint{0.608cm}{1.276cm}}{\pgfqpoint{0.643cm}{1.262cm}}{\pgfqpoint{0.679cm}{1.262cm}}
\pgfpathcurveto{\pgfqpoint{0.715cm}{1.262cm}}{\pgfqpoint{0.75cm}{1.276cm}}{\pgfqpoint{0.775cm}{1.302cm}}
\pgfpathcurveto{\pgfqpoint{0.801cm}{1.328cm}}{\pgfqpoint{0.815cm}{1.363cm}}{\pgfqpoint{0.815cm}{1.399cm}}
\pgfusepath{fill}
\pgfpathmoveto{\pgfqpoint{1.345cm}{1.371cm}}
\pgfpathcurveto{\pgfqpoint{1.345cm}{1.408cm}}{\pgfqpoint{1.331cm}{1.442cm}}{\pgfqpoint{1.305cm}{1.468cm}}
\pgfpathcurveto{\pgfqpoint{1.28cm}{1.494cm}}{\pgfqpoint{1.245cm}{1.508cm}}{\pgfqpoint{1.209cm}{1.508cm}}
\pgfpathcurveto{\pgfqpoint{1.172cm}{1.508cm}}{\pgfqpoint{1.138cm}{1.494cm}}{\pgfqpoint{1.112cm}{1.468cm}}
\pgfpathcurveto{\pgfqpoint{1.087cm}{1.442cm}}{\pgfqpoint{1.072cm}{1.408cm}}{\pgfqpoint{1.072cm}{1.371cm}}
\pgfpathcurveto{\pgfqpoint{1.072cm}{1.335cm}}{\pgfqpoint{1.087cm}{1.3cm}}{\pgfqpoint{1.112cm}{1.274cm}}
\pgfpathcurveto{\pgfqpoint{1.138cm}{1.249cm}}{\pgfqpoint{1.172cm}{1.234cm}}{\pgfqpoint{1.209cm}{1.234cm}}
\pgfpathcurveto{\pgfqpoint{1.245cm}{1.234cm}}{\pgfqpoint{1.28cm}{1.249cm}}{\pgfqpoint{1.305cm}{1.274cm}}
\pgfpathcurveto{\pgfqpoint{1.331cm}{1.3cm}}{\pgfqpoint{1.345cm}{1.335cm}}{\pgfqpoint{1.345cm}{1.371cm}}
\pgfusepath{fill}
\begin{pgfscope}
\pgfsetdash{}{0cm}
\pgfsetlinewidth{0.818mm}
\pgfsetroundcap
\pgfsetmiterlimit{4.0}
\pgfpathmoveto{\pgfqpoint{0.682cm}{0.671cm}}
\pgfpathlineto{\pgfqpoint{0.682cm}{0.042cm}}
\pgfusepath{stroke}
\end{pgfscope}
\end{pgfscope}
\end{pgfscope}
\end{pgfscope}
\end{tikzpicture}}}(\phi+\psi))}+\rmm{6(\phi+\psi)\prec\UU_{>}X^{\!\resizebox{!}{.8em}{
\begin{tikzpicture}
\pgfpathmoveto{\pgfqpoint{0cm}{-0.035cm}}
\pgfpathlineto{\pgfqpoint{1.976cm}{-0.035cm}}
\pgfpathlineto{\pgfqpoint{1.976cm}{1.94cm}}
\pgfpathlineto{\pgfqpoint{0cm}{1.94cm}}
\pgfpathclose
\pgfusepath{clip}
\begin{pgfscope}
\begin{pgfscope}
\pgfpathmoveto{\pgfqpoint{0cm}{-0.035cm}}
\pgfpathlineto{\pgfqpoint{1.976cm}{-0.035cm}}
\pgfpathlineto{\pgfqpoint{1.976cm}{1.94cm}}
\pgfpathlineto{\pgfqpoint{0cm}{1.94cm}}
\pgfpathclose
\pgfusepath{clip}
\begin{pgfscope}
\begin{pgfscope}
\pgfsetdash{}{0cm}
\pgfsetlinewidth{0.818mm}
\pgfsetroundcap
\pgfsetroundjoin
\pgfsetmiterlimit{7.0}
\definecolor{eps2pgf_color}{gray}{0}\pgfsetstrokecolor{eps2pgf_color}\pgfsetfillcolor{eps2pgf_color}
\pgfpathmoveto{\pgfqpoint{0.117cm}{1.815cm}}
\pgfpathlineto{\pgfqpoint{0.682cm}{1.065cm}}
\pgfpathlineto{\pgfqpoint{1.246cm}{1.815cm}}
\pgfusepath{stroke}
\end{pgfscope}
\definecolor{eps2pgf_color}{gray}{0}\pgfsetstrokecolor{eps2pgf_color}\pgfsetfillcolor{eps2pgf_color}
\pgfpathmoveto{\pgfqpoint{0.273cm}{1.789cm}}
\pgfpathcurveto{\pgfqpoint{0.273cm}{1.825cm}}{\pgfqpoint{0.259cm}{1.86cm}}{\pgfqpoint{0.233cm}{1.886cm}}
\pgfpathcurveto{\pgfqpoint{0.207cm}{1.912cm}}{\pgfqpoint{0.173cm}{1.926cm}}{\pgfqpoint{0.137cm}{1.926cm}}
\pgfpathcurveto{\pgfqpoint{0.1cm}{1.926cm}}{\pgfqpoint{0.066cm}{1.912cm}}{\pgfqpoint{0.04cm}{1.886cm}}
\pgfpathcurveto{\pgfqpoint{0.014cm}{1.86cm}}{\pgfqpoint{0cm}{1.825cm}}{\pgfqpoint{0cm}{1.789cm}}
\pgfpathcurveto{\pgfqpoint{0cm}{1.753cm}}{\pgfqpoint{0.014cm}{1.718cm}}{\pgfqpoint{0.04cm}{1.692cm}}
\pgfpathcurveto{\pgfqpoint{0.066cm}{1.667cm}}{\pgfqpoint{0.1cm}{1.652cm}}{\pgfqpoint{0.137cm}{1.652cm}}
\pgfpathcurveto{\pgfqpoint{0.173cm}{1.652cm}}{\pgfqpoint{0.207cm}{1.667cm}}{\pgfqpoint{0.233cm}{1.692cm}}
\pgfpathcurveto{\pgfqpoint{0.259cm}{1.718cm}}{\pgfqpoint{0.273cm}{1.753cm}}{\pgfqpoint{0.273cm}{1.789cm}}
\pgfusepath{fill}
\begin{pgfscope}
\pgfsetdash{}{0cm}
\pgfsetlinewidth{0.818mm}
\pgfsetmiterlimit{7.0}
\pgfpathmoveto{\pgfqpoint{0.682cm}{1.065cm}}
\pgfpathlineto{\pgfqpoint{0.679cm}{1.812cm}}
\pgfusepath{stroke}
\end{pgfscope}
\pgfpathmoveto{\pgfqpoint{0.815cm}{1.793cm}}
\pgfpathcurveto{\pgfqpoint{0.815cm}{1.829cm}}{\pgfqpoint{0.801cm}{1.864cm}}{\pgfqpoint{0.775cm}{1.89cm}}
\pgfpathcurveto{\pgfqpoint{0.75cm}{1.915cm}}{\pgfqpoint{0.715cm}{1.93cm}}{\pgfqpoint{0.679cm}{1.93cm}}
\pgfpathcurveto{\pgfqpoint{0.643cm}{1.93cm}}{\pgfqpoint{0.608cm}{1.915cm}}{\pgfqpoint{0.582cm}{1.89cm}}
\pgfpathcurveto{\pgfqpoint{0.557cm}{1.864cm}}{\pgfqpoint{0.542cm}{1.829cm}}{\pgfqpoint{0.542cm}{1.793cm}}
\pgfpathcurveto{\pgfqpoint{0.542cm}{1.756cm}}{\pgfqpoint{0.557cm}{1.722cm}}{\pgfqpoint{0.582cm}{1.696cm}}
\pgfpathcurveto{\pgfqpoint{0.608cm}{1.67cm}}{\pgfqpoint{0.643cm}{1.656cm}}{\pgfqpoint{0.679cm}{1.656cm}}
\pgfpathcurveto{\pgfqpoint{0.715cm}{1.656cm}}{\pgfqpoint{0.75cm}{1.67cm}}{\pgfqpoint{0.775cm}{1.696cm}}
\pgfpathcurveto{\pgfqpoint{0.801cm}{1.722cm}}{\pgfqpoint{0.815cm}{1.756cm}}{\pgfqpoint{0.815cm}{1.793cm}}
\pgfusepath{fill}
\pgfpathmoveto{\pgfqpoint{1.345cm}{1.765cm}}
\pgfpathcurveto{\pgfqpoint{1.345cm}{1.801cm}}{\pgfqpoint{1.331cm}{1.836cm}}{\pgfqpoint{1.305cm}{1.862cm}}
\pgfpathcurveto{\pgfqpoint{1.28cm}{1.887cm}}{\pgfqpoint{1.245cm}{1.902cm}}{\pgfqpoint{1.209cm}{1.902cm}}
\pgfpathcurveto{\pgfqpoint{1.172cm}{1.902cm}}{\pgfqpoint{1.138cm}{1.887cm}}{\pgfqpoint{1.112cm}{1.862cm}}
\pgfpathcurveto{\pgfqpoint{1.087cm}{1.836cm}}{\pgfqpoint{1.072cm}{1.801cm}}{\pgfqpoint{1.072cm}{1.765cm}}
\pgfpathcurveto{\pgfqpoint{1.072cm}{1.728cm}}{\pgfqpoint{1.087cm}{1.694cm}}{\pgfqpoint{1.112cm}{1.668cm}}
\pgfpathcurveto{\pgfqpoint{1.138cm}{1.642cm}}{\pgfqpoint{1.172cm}{1.628cm}}{\pgfqpoint{1.209cm}{1.628cm}}
\pgfpathcurveto{\pgfqpoint{1.245cm}{1.628cm}}{\pgfqpoint{1.28cm}{1.642cm}}{\pgfqpoint{1.305cm}{1.668cm}}
\pgfpathcurveto{\pgfqpoint{1.331cm}{1.694cm}}{\pgfqpoint{1.345cm}{1.728cm}}{\pgfqpoint{1.345cm}{1.765cm}}
\pgfusepath{fill}
\begin{pgfscope}
\pgfsetdash{}{0cm}
\pgfsetlinewidth{0.818mm}
\pgfsetroundcap
\pgfsetroundjoin
\pgfsetmiterlimit{7.0}
\pgfpathmoveto{\pgfqpoint{0.682cm}{1.065cm}}
\pgfpathlineto{\pgfqpoint{1.246cm}{0.315cm}}
\pgfpathlineto{\pgfqpoint{1.811cm}{1.065cm}}
\pgfusepath{stroke}
\end{pgfscope}
\pgfpathmoveto{\pgfqpoint{1.948cm}{1.065cm}}
\pgfpathcurveto{\pgfqpoint{1.948cm}{1.101cm}}{\pgfqpoint{1.933cm}{1.136cm}}{\pgfqpoint{1.907cm}{1.162cm}}
\pgfpathcurveto{\pgfqpoint{1.882cm}{1.187cm}}{\pgfqpoint{1.847cm}{1.202cm}}{\pgfqpoint{1.811cm}{1.202cm}}
\pgfpathcurveto{\pgfqpoint{1.775cm}{1.202cm}}{\pgfqpoint{1.74cm}{1.187cm}}{\pgfqpoint{1.714cm}{1.162cm}}
\pgfpathcurveto{\pgfqpoint{1.689cm}{1.136cm}}{\pgfqpoint{1.674cm}{1.101cm}}{\pgfqpoint{1.674cm}{1.065cm}}
\pgfpathcurveto{\pgfqpoint{1.674cm}{1.029cm}}{\pgfqpoint{1.689cm}{0.994cm}}{\pgfqpoint{1.714cm}{0.968cm}}
\pgfpathcurveto{\pgfqpoint{1.74cm}{0.942cm}}{\pgfqpoint{1.775cm}{0.928cm}}{\pgfqpoint{1.811cm}{0.928cm}}
\pgfpathcurveto{\pgfqpoint{1.847cm}{0.928cm}}{\pgfqpoint{1.882cm}{0.942cm}}{\pgfqpoint{1.907cm}{0.968cm}}
\pgfpathcurveto{\pgfqpoint{1.933cm}{0.994cm}}{\pgfqpoint{1.948cm}{1.029cm}}{\pgfqpoint{1.948cm}{1.065cm}}
\pgfusepath{fill}
\begin{pgfscope}
\pgfsetdash{}{0cm}
\pgfsetlinewidth{0.818mm}
\pgfsetmiterlimit{4.0}
\pgfpathmoveto{\pgfqpoint{1.383cm}{0.178cm}}
\pgfpathcurveto{\pgfqpoint{1.383cm}{0.214cm}}{\pgfqpoint{1.369cm}{0.249cm}}{\pgfqpoint{1.343cm}{0.275cm}}
\pgfpathcurveto{\pgfqpoint{1.317cm}{0.3cm}}{\pgfqpoint{1.283cm}{0.315cm}}{\pgfqpoint{1.246cm}{0.315cm}}
\pgfpathcurveto{\pgfqpoint{1.21cm}{0.315cm}}{\pgfqpoint{1.175cm}{0.3cm}}{\pgfqpoint{1.15cm}{0.275cm}}
\pgfpathcurveto{\pgfqpoint{1.124cm}{0.249cm}}{\pgfqpoint{1.11cm}{0.214cm}}{\pgfqpoint{1.11cm}{0.178cm}}
\pgfpathcurveto{\pgfqpoint{1.11cm}{0.141cm}}{\pgfqpoint{1.124cm}{0.107cm}}{\pgfqpoint{1.15cm}{0.081cm}}
\pgfpathcurveto{\pgfqpoint{1.175cm}{0.055cm}}{\pgfqpoint{1.21cm}{0.041cm}}{\pgfqpoint{1.246cm}{0.041cm}}
\pgfpathcurveto{\pgfqpoint{1.283cm}{0.041cm}}{\pgfqpoint{1.317cm}{0.055cm}}{\pgfqpoint{1.343cm}{0.081cm}}
\pgfpathcurveto{\pgfqpoint{1.369cm}{0.107cm}}{\pgfqpoint{1.383cm}{0.141cm}}{\pgfqpoint{1.383cm}{0.178cm}}
\pgfusepath{stroke}
\end{pgfscope}
\end{pgfscope}
\end{pgfscope}
\end{pgfscope}
\end{tikzpicture}}}}\\
&\qquad+\rmm{3\UU_> X\succ(\phi+\psi)^2},
\end{align*}
\begin{align*}
\rmb{\Psi}&:=\rmb{3\llbracket X^2 \rrbracket\circ\psi}+\rmb{3\llbracket X^2 \rrbracket\circ\vartheta}-\rmb{9\mathrm{com}(-X^{\!\resizebox{0.6em}{!}{
\begin{tikzpicture}
\pgfpathmoveto{\pgfqpoint{0cm}{-0.035cm}}
\pgfpathlineto{\pgfqpoint{1.376cm}{-0.035cm}}
\pgfpathlineto{\pgfqpoint{1.376cm}{1.552cm}}
\pgfpathlineto{\pgfqpoint{0cm}{1.552cm}}
\pgfpathclose
\pgfusepath{clip}
\begin{pgfscope}
\begin{pgfscope}
\pgfpathmoveto{\pgfqpoint{0cm}{-0.035cm}}
\pgfpathlineto{\pgfqpoint{1.376cm}{-0.035cm}}
\pgfpathlineto{\pgfqpoint{1.376cm}{1.552cm}}
\pgfpathlineto{\pgfqpoint{0cm}{1.552cm}}
\pgfpathclose
\pgfusepath{clip}
\begin{pgfscope}
\begin{pgfscope}
\pgfsetdash{}{0cm}
\pgfsetlinewidth{0.818mm}
\pgfsetroundcap
\pgfsetroundjoin
\pgfsetmiterlimit{7.0}
\definecolor{eps2pgf_color}{gray}{0}\pgfsetstrokecolor{eps2pgf_color}\pgfsetfillcolor{eps2pgf_color}
\pgfpathmoveto{\pgfqpoint{0.117cm}{1.421cm}}
\pgfpathlineto{\pgfqpoint{0.682cm}{0.671cm}}
\pgfpathlineto{\pgfqpoint{1.246cm}{1.421cm}}
\pgfusepath{stroke}
\end{pgfscope}
\definecolor{eps2pgf_color}{gray}{0}\pgfsetstrokecolor{eps2pgf_color}\pgfsetfillcolor{eps2pgf_color}
\pgfpathmoveto{\pgfqpoint{0.273cm}{1.395cm}}
\pgfpathcurveto{\pgfqpoint{0.273cm}{1.432cm}}{\pgfqpoint{0.259cm}{1.467cm}}{\pgfqpoint{0.233cm}{1.492cm}}
\pgfpathcurveto{\pgfqpoint{0.207cm}{1.518cm}}{\pgfqpoint{0.173cm}{1.532cm}}{\pgfqpoint{0.137cm}{1.532cm}}
\pgfpathcurveto{\pgfqpoint{0.1cm}{1.532cm}}{\pgfqpoint{0.066cm}{1.518cm}}{\pgfqpoint{0.04cm}{1.492cm}}
\pgfpathcurveto{\pgfqpoint{0.014cm}{1.467cm}}{\pgfqpoint{0cm}{1.432cm}}{\pgfqpoint{0cm}{1.395cm}}
\pgfpathcurveto{\pgfqpoint{0cm}{1.359cm}}{\pgfqpoint{0.014cm}{1.324cm}}{\pgfqpoint{0.04cm}{1.299cm}}
\pgfpathcurveto{\pgfqpoint{0.066cm}{1.273cm}}{\pgfqpoint{0.1cm}{1.258cm}}{\pgfqpoint{0.137cm}{1.258cm}}
\pgfpathcurveto{\pgfqpoint{0.173cm}{1.258cm}}{\pgfqpoint{0.207cm}{1.273cm}}{\pgfqpoint{0.233cm}{1.299cm}}
\pgfpathcurveto{\pgfqpoint{0.259cm}{1.324cm}}{\pgfqpoint{0.273cm}{1.359cm}}{\pgfqpoint{0.273cm}{1.395cm}}
\pgfusepath{fill}
\begin{pgfscope}
\pgfsetdash{}{0cm}
\pgfsetlinewidth{0.818mm}
\pgfsetmiterlimit{7.0}
\pgfpathmoveto{\pgfqpoint{0.682cm}{0.671cm}}
\pgfpathlineto{\pgfqpoint{0.679cm}{1.418cm}}
\pgfusepath{stroke}
\end{pgfscope}
\pgfpathmoveto{\pgfqpoint{0.815cm}{1.399cm}}
\pgfpathcurveto{\pgfqpoint{0.815cm}{1.435cm}}{\pgfqpoint{0.801cm}{1.47cm}}{\pgfqpoint{0.775cm}{1.496cm}}
\pgfpathcurveto{\pgfqpoint{0.75cm}{1.521cm}}{\pgfqpoint{0.715cm}{1.536cm}}{\pgfqpoint{0.679cm}{1.536cm}}
\pgfpathcurveto{\pgfqpoint{0.643cm}{1.536cm}}{\pgfqpoint{0.608cm}{1.521cm}}{\pgfqpoint{0.582cm}{1.496cm}}
\pgfpathcurveto{\pgfqpoint{0.557cm}{1.47cm}}{\pgfqpoint{0.542cm}{1.435cm}}{\pgfqpoint{0.542cm}{1.399cm}}
\pgfpathcurveto{\pgfqpoint{0.542cm}{1.363cm}}{\pgfqpoint{0.557cm}{1.328cm}}{\pgfqpoint{0.582cm}{1.302cm}}
\pgfpathcurveto{\pgfqpoint{0.608cm}{1.276cm}}{\pgfqpoint{0.643cm}{1.262cm}}{\pgfqpoint{0.679cm}{1.262cm}}
\pgfpathcurveto{\pgfqpoint{0.715cm}{1.262cm}}{\pgfqpoint{0.75cm}{1.276cm}}{\pgfqpoint{0.775cm}{1.302cm}}
\pgfpathcurveto{\pgfqpoint{0.801cm}{1.328cm}}{\pgfqpoint{0.815cm}{1.363cm}}{\pgfqpoint{0.815cm}{1.399cm}}
\pgfusepath{fill}
\pgfpathmoveto{\pgfqpoint{1.345cm}{1.371cm}}
\pgfpathcurveto{\pgfqpoint{1.345cm}{1.408cm}}{\pgfqpoint{1.331cm}{1.442cm}}{\pgfqpoint{1.305cm}{1.468cm}}
\pgfpathcurveto{\pgfqpoint{1.28cm}{1.494cm}}{\pgfqpoint{1.245cm}{1.508cm}}{\pgfqpoint{1.209cm}{1.508cm}}
\pgfpathcurveto{\pgfqpoint{1.172cm}{1.508cm}}{\pgfqpoint{1.138cm}{1.494cm}}{\pgfqpoint{1.112cm}{1.468cm}}
\pgfpathcurveto{\pgfqpoint{1.087cm}{1.442cm}}{\pgfqpoint{1.072cm}{1.408cm}}{\pgfqpoint{1.072cm}{1.371cm}}
\pgfpathcurveto{\pgfqpoint{1.072cm}{1.335cm}}{\pgfqpoint{1.087cm}{1.3cm}}{\pgfqpoint{1.112cm}{1.274cm}}
\pgfpathcurveto{\pgfqpoint{1.138cm}{1.249cm}}{\pgfqpoint{1.172cm}{1.234cm}}{\pgfqpoint{1.209cm}{1.234cm}}
\pgfpathcurveto{\pgfqpoint{1.245cm}{1.234cm}}{\pgfqpoint{1.28cm}{1.249cm}}{\pgfqpoint{1.305cm}{1.274cm}}
\pgfpathcurveto{\pgfqpoint{1.331cm}{1.3cm}}{\pgfqpoint{1.345cm}{1.335cm}}{\pgfqpoint{1.345cm}{1.371cm}}
\pgfusepath{fill}
\begin{pgfscope}
\pgfsetdash{}{0cm}
\pgfsetlinewidth{0.818mm}
\pgfsetroundcap
\pgfsetmiterlimit{4.0}
\pgfpathmoveto{\pgfqpoint{0.682cm}{0.671cm}}
\pgfpathlineto{\pgfqpoint{0.682cm}{0.042cm}}
\pgfusepath{stroke}
\end{pgfscope}
\end{pgfscope}
\end{pgfscope}
\end{pgfscope}
\end{tikzpicture}}} + \phi + \psi,X^{\!\resizebox{0.6em}{!}{
\begin{tikzpicture}
\pgfpathmoveto{\pgfqpoint{0cm}{0cm}}
\pgfpathlineto{\pgfqpoint{1.376cm}{0cm}}
\pgfpathlineto{\pgfqpoint{1.376cm}{1.588cm}}
\pgfpathlineto{\pgfqpoint{0cm}{1.588cm}}
\pgfpathclose
\pgfusepath{clip}
\begin{pgfscope}
\begin{pgfscope}
\pgfpathmoveto{\pgfqpoint{0cm}{0cm}}
\pgfpathlineto{\pgfqpoint{1.376cm}{0cm}}
\pgfpathlineto{\pgfqpoint{1.376cm}{1.588cm}}
\pgfpathlineto{\pgfqpoint{0cm}{1.588cm}}
\pgfpathclose
\pgfusepath{clip}
\begin{pgfscope}
\begin{pgfscope}
\definecolor{eps2pgf_color}{gray}{0.976471}\pgfsetstrokecolor{eps2pgf_color}\pgfsetfillcolor{eps2pgf_color}
\pgfpathmoveto{\pgfqpoint{0cm}{0cm}}
\pgfpathlineto{\pgfqpoint{1.376cm}{0cm}}
\pgfpathlineto{\pgfqpoint{1.376cm}{1.588cm}}
\pgfpathlineto{\pgfqpoint{0cm}{1.588cm}}
\pgfpathclose
\pgfusepath{fill}
\end{pgfscope}
\begin{pgfscope}
\pgfsetdash{}{0cm}
\pgfsetlinewidth{0.818mm}
\pgfsetroundcap
\pgfsetroundjoin
\pgfsetmiterlimit{7.0}
\definecolor{eps2pgf_color}{gray}{0}\pgfsetstrokecolor{eps2pgf_color}\pgfsetfillcolor{eps2pgf_color}
\pgfpathmoveto{\pgfqpoint{0.117cm}{1.476cm}}
\pgfpathlineto{\pgfqpoint{0.682cm}{0.726cm}}
\pgfpathlineto{\pgfqpoint{1.246cm}{1.476cm}}
\pgfusepath{stroke}
\end{pgfscope}
\definecolor{eps2pgf_color}{gray}{0}\pgfsetstrokecolor{eps2pgf_color}\pgfsetfillcolor{eps2pgf_color}
\pgfpathmoveto{\pgfqpoint{0.273cm}{1.451cm}}
\pgfpathcurveto{\pgfqpoint{0.273cm}{1.487cm}}{\pgfqpoint{0.259cm}{1.522cm}}{\pgfqpoint{0.233cm}{1.547cm}}
\pgfpathcurveto{\pgfqpoint{0.207cm}{1.573cm}}{\pgfqpoint{0.173cm}{1.588cm}}{\pgfqpoint{0.137cm}{1.588cm}}
\pgfpathcurveto{\pgfqpoint{0.1cm}{1.588cm}}{\pgfqpoint{0.066cm}{1.573cm}}{\pgfqpoint{0.04cm}{1.547cm}}
\pgfpathcurveto{\pgfqpoint{0.014cm}{1.522cm}}{\pgfqpoint{0cm}{1.487cm}}{\pgfqpoint{0cm}{1.451cm}}
\pgfpathcurveto{\pgfqpoint{0cm}{1.414cm}}{\pgfqpoint{0.014cm}{1.379cm}}{\pgfqpoint{0.04cm}{1.354cm}}
\pgfpathcurveto{\pgfqpoint{0.066cm}{1.328cm}}{\pgfqpoint{0.1cm}{1.314cm}}{\pgfqpoint{0.137cm}{1.314cm}}
\pgfpathcurveto{\pgfqpoint{0.173cm}{1.314cm}}{\pgfqpoint{0.207cm}{1.328cm}}{\pgfqpoint{0.233cm}{1.354cm}}
\pgfpathcurveto{\pgfqpoint{0.259cm}{1.379cm}}{\pgfqpoint{0.273cm}{1.414cm}}{\pgfqpoint{0.273cm}{1.451cm}}
\pgfusepath{fill}
\pgfpathmoveto{\pgfqpoint{1.345cm}{1.426cm}}
\pgfpathcurveto{\pgfqpoint{1.345cm}{1.463cm}}{\pgfqpoint{1.331cm}{1.497cm}}{\pgfqpoint{1.305cm}{1.523cm}}
\pgfpathcurveto{\pgfqpoint{1.28cm}{1.549cm}}{\pgfqpoint{1.245cm}{1.563cm}}{\pgfqpoint{1.209cm}{1.563cm}}
\pgfpathcurveto{\pgfqpoint{1.172cm}{1.563cm}}{\pgfqpoint{1.138cm}{1.549cm}}{\pgfqpoint{1.112cm}{1.523cm}}
\pgfpathcurveto{\pgfqpoint{1.087cm}{1.497cm}}{\pgfqpoint{1.072cm}{1.463cm}}{\pgfqpoint{1.072cm}{1.426cm}}
\pgfpathcurveto{\pgfqpoint{1.072cm}{1.39cm}}{\pgfqpoint{1.087cm}{1.355cm}}{\pgfqpoint{1.112cm}{1.329cm}}
\pgfpathcurveto{\pgfqpoint{1.138cm}{1.304cm}}{\pgfqpoint{1.172cm}{1.289cm}}{\pgfqpoint{1.209cm}{1.289cm}}
\pgfpathcurveto{\pgfqpoint{1.245cm}{1.289cm}}{\pgfqpoint{1.28cm}{1.304cm}}{\pgfqpoint{1.305cm}{1.329cm}}
\pgfpathcurveto{\pgfqpoint{1.331cm}{1.355cm}}{\pgfqpoint{1.345cm}{1.39cm}}{\pgfqpoint{1.345cm}{1.426cm}}
\pgfusepath{fill}
\begin{pgfscope}
\pgfsetdash{}{0cm}
\pgfsetlinewidth{0.818mm}
\pgfsetroundcap
\pgfsetmiterlimit{4.0}
\pgfpathmoveto{\pgfqpoint{0.682cm}{0.726cm}}
\pgfpathlineto{\pgfqpoint{0.682cm}{0.097cm}}
\pgfusepath{stroke}
\end{pgfscope}
\end{pgfscope}
\end{pgfscope}
\end{pgfscope}
\end{tikzpicture}}},\llbracket X^2 \rrbracket)}+\rmb{6\mathrm{com}(X^{\!\resizebox{0.6em}{!}{
\begin{tikzpicture}
\pgfpathmoveto{\pgfqpoint{0cm}{-0.035cm}}
\pgfpathlineto{\pgfqpoint{1.376cm}{-0.035cm}}
\pgfpathlineto{\pgfqpoint{1.376cm}{1.552cm}}
\pgfpathlineto{\pgfqpoint{0cm}{1.552cm}}
\pgfpathclose
\pgfusepath{clip}
\begin{pgfscope}
\begin{pgfscope}
\pgfpathmoveto{\pgfqpoint{0cm}{-0.035cm}}
\pgfpathlineto{\pgfqpoint{1.376cm}{-0.035cm}}
\pgfpathlineto{\pgfqpoint{1.376cm}{1.552cm}}
\pgfpathlineto{\pgfqpoint{0cm}{1.552cm}}
\pgfpathclose
\pgfusepath{clip}
\begin{pgfscope}
\begin{pgfscope}
\pgfsetdash{}{0cm}
\pgfsetlinewidth{0.818mm}
\pgfsetroundcap
\pgfsetroundjoin
\pgfsetmiterlimit{7.0}
\definecolor{eps2pgf_color}{gray}{0}\pgfsetstrokecolor{eps2pgf_color}\pgfsetfillcolor{eps2pgf_color}
\pgfpathmoveto{\pgfqpoint{0.117cm}{1.421cm}}
\pgfpathlineto{\pgfqpoint{0.682cm}{0.671cm}}
\pgfpathlineto{\pgfqpoint{1.246cm}{1.421cm}}
\pgfusepath{stroke}
\end{pgfscope}
\definecolor{eps2pgf_color}{gray}{0}\pgfsetstrokecolor{eps2pgf_color}\pgfsetfillcolor{eps2pgf_color}
\pgfpathmoveto{\pgfqpoint{0.273cm}{1.395cm}}
\pgfpathcurveto{\pgfqpoint{0.273cm}{1.432cm}}{\pgfqpoint{0.259cm}{1.467cm}}{\pgfqpoint{0.233cm}{1.492cm}}
\pgfpathcurveto{\pgfqpoint{0.207cm}{1.518cm}}{\pgfqpoint{0.173cm}{1.532cm}}{\pgfqpoint{0.137cm}{1.532cm}}
\pgfpathcurveto{\pgfqpoint{0.1cm}{1.532cm}}{\pgfqpoint{0.066cm}{1.518cm}}{\pgfqpoint{0.04cm}{1.492cm}}
\pgfpathcurveto{\pgfqpoint{0.014cm}{1.467cm}}{\pgfqpoint{0cm}{1.432cm}}{\pgfqpoint{0cm}{1.395cm}}
\pgfpathcurveto{\pgfqpoint{0cm}{1.359cm}}{\pgfqpoint{0.014cm}{1.324cm}}{\pgfqpoint{0.04cm}{1.299cm}}
\pgfpathcurveto{\pgfqpoint{0.066cm}{1.273cm}}{\pgfqpoint{0.1cm}{1.258cm}}{\pgfqpoint{0.137cm}{1.258cm}}
\pgfpathcurveto{\pgfqpoint{0.173cm}{1.258cm}}{\pgfqpoint{0.207cm}{1.273cm}}{\pgfqpoint{0.233cm}{1.299cm}}
\pgfpathcurveto{\pgfqpoint{0.259cm}{1.324cm}}{\pgfqpoint{0.273cm}{1.359cm}}{\pgfqpoint{0.273cm}{1.395cm}}
\pgfusepath{fill}
\begin{pgfscope}
\pgfsetdash{}{0cm}
\pgfsetlinewidth{0.818mm}
\pgfsetmiterlimit{7.0}
\pgfpathmoveto{\pgfqpoint{0.682cm}{0.671cm}}
\pgfpathlineto{\pgfqpoint{0.679cm}{1.418cm}}
\pgfusepath{stroke}
\end{pgfscope}
\pgfpathmoveto{\pgfqpoint{0.815cm}{1.399cm}}
\pgfpathcurveto{\pgfqpoint{0.815cm}{1.435cm}}{\pgfqpoint{0.801cm}{1.47cm}}{\pgfqpoint{0.775cm}{1.496cm}}
\pgfpathcurveto{\pgfqpoint{0.75cm}{1.521cm}}{\pgfqpoint{0.715cm}{1.536cm}}{\pgfqpoint{0.679cm}{1.536cm}}
\pgfpathcurveto{\pgfqpoint{0.643cm}{1.536cm}}{\pgfqpoint{0.608cm}{1.521cm}}{\pgfqpoint{0.582cm}{1.496cm}}
\pgfpathcurveto{\pgfqpoint{0.557cm}{1.47cm}}{\pgfqpoint{0.542cm}{1.435cm}}{\pgfqpoint{0.542cm}{1.399cm}}
\pgfpathcurveto{\pgfqpoint{0.542cm}{1.363cm}}{\pgfqpoint{0.557cm}{1.328cm}}{\pgfqpoint{0.582cm}{1.302cm}}
\pgfpathcurveto{\pgfqpoint{0.608cm}{1.276cm}}{\pgfqpoint{0.643cm}{1.262cm}}{\pgfqpoint{0.679cm}{1.262cm}}
\pgfpathcurveto{\pgfqpoint{0.715cm}{1.262cm}}{\pgfqpoint{0.75cm}{1.276cm}}{\pgfqpoint{0.775cm}{1.302cm}}
\pgfpathcurveto{\pgfqpoint{0.801cm}{1.328cm}}{\pgfqpoint{0.815cm}{1.363cm}}{\pgfqpoint{0.815cm}{1.399cm}}
\pgfusepath{fill}
\pgfpathmoveto{\pgfqpoint{1.345cm}{1.371cm}}
\pgfpathcurveto{\pgfqpoint{1.345cm}{1.408cm}}{\pgfqpoint{1.331cm}{1.442cm}}{\pgfqpoint{1.305cm}{1.468cm}}
\pgfpathcurveto{\pgfqpoint{1.28cm}{1.494cm}}{\pgfqpoint{1.245cm}{1.508cm}}{\pgfqpoint{1.209cm}{1.508cm}}
\pgfpathcurveto{\pgfqpoint{1.172cm}{1.508cm}}{\pgfqpoint{1.138cm}{1.494cm}}{\pgfqpoint{1.112cm}{1.468cm}}
\pgfpathcurveto{\pgfqpoint{1.087cm}{1.442cm}}{\pgfqpoint{1.072cm}{1.408cm}}{\pgfqpoint{1.072cm}{1.371cm}}
\pgfpathcurveto{\pgfqpoint{1.072cm}{1.335cm}}{\pgfqpoint{1.087cm}{1.3cm}}{\pgfqpoint{1.112cm}{1.274cm}}
\pgfpathcurveto{\pgfqpoint{1.138cm}{1.249cm}}{\pgfqpoint{1.172cm}{1.234cm}}{\pgfqpoint{1.209cm}{1.234cm}}
\pgfpathcurveto{\pgfqpoint{1.245cm}{1.234cm}}{\pgfqpoint{1.28cm}{1.249cm}}{\pgfqpoint{1.305cm}{1.274cm}}
\pgfpathcurveto{\pgfqpoint{1.331cm}{1.3cm}}{\pgfqpoint{1.345cm}{1.335cm}}{\pgfqpoint{1.345cm}{1.371cm}}
\pgfusepath{fill}
\begin{pgfscope}
\pgfsetdash{}{0cm}
\pgfsetlinewidth{0.818mm}
\pgfsetroundcap
\pgfsetmiterlimit{4.0}
\pgfpathmoveto{\pgfqpoint{0.682cm}{0.671cm}}
\pgfpathlineto{\pgfqpoint{0.682cm}{0.042cm}}
\pgfusepath{stroke}
\end{pgfscope}
\end{pgfscope}
\end{pgfscope}
\end{pgfscope}
\end{tikzpicture}}},X^{\!\resizebox{0.6em}{!}{
\begin{tikzpicture}
\pgfpathmoveto{\pgfqpoint{0cm}{-0.035cm}}
\pgfpathlineto{\pgfqpoint{1.376cm}{-0.035cm}}
\pgfpathlineto{\pgfqpoint{1.376cm}{1.552cm}}
\pgfpathlineto{\pgfqpoint{0cm}{1.552cm}}
\pgfpathclose
\pgfusepath{clip}
\begin{pgfscope}
\begin{pgfscope}
\pgfpathmoveto{\pgfqpoint{0cm}{-0.035cm}}
\pgfpathlineto{\pgfqpoint{1.376cm}{-0.035cm}}
\pgfpathlineto{\pgfqpoint{1.376cm}{1.552cm}}
\pgfpathlineto{\pgfqpoint{0cm}{1.552cm}}
\pgfpathclose
\pgfusepath{clip}
\begin{pgfscope}
\begin{pgfscope}
\pgfsetdash{}{0cm}
\pgfsetlinewidth{0.818mm}
\pgfsetroundcap
\pgfsetroundjoin
\pgfsetmiterlimit{7.0}
\definecolor{eps2pgf_color}{gray}{0}\pgfsetstrokecolor{eps2pgf_color}\pgfsetfillcolor{eps2pgf_color}
\pgfpathmoveto{\pgfqpoint{0.117cm}{1.421cm}}
\pgfpathlineto{\pgfqpoint{0.682cm}{0.671cm}}
\pgfpathlineto{\pgfqpoint{1.246cm}{1.421cm}}
\pgfusepath{stroke}
\end{pgfscope}
\definecolor{eps2pgf_color}{gray}{0}\pgfsetstrokecolor{eps2pgf_color}\pgfsetfillcolor{eps2pgf_color}
\pgfpathmoveto{\pgfqpoint{0.273cm}{1.395cm}}
\pgfpathcurveto{\pgfqpoint{0.273cm}{1.432cm}}{\pgfqpoint{0.259cm}{1.467cm}}{\pgfqpoint{0.233cm}{1.492cm}}
\pgfpathcurveto{\pgfqpoint{0.207cm}{1.518cm}}{\pgfqpoint{0.173cm}{1.532cm}}{\pgfqpoint{0.137cm}{1.532cm}}
\pgfpathcurveto{\pgfqpoint{0.1cm}{1.532cm}}{\pgfqpoint{0.066cm}{1.518cm}}{\pgfqpoint{0.04cm}{1.492cm}}
\pgfpathcurveto{\pgfqpoint{0.014cm}{1.467cm}}{\pgfqpoint{0cm}{1.432cm}}{\pgfqpoint{0cm}{1.395cm}}
\pgfpathcurveto{\pgfqpoint{0cm}{1.359cm}}{\pgfqpoint{0.014cm}{1.324cm}}{\pgfqpoint{0.04cm}{1.299cm}}
\pgfpathcurveto{\pgfqpoint{0.066cm}{1.273cm}}{\pgfqpoint{0.1cm}{1.258cm}}{\pgfqpoint{0.137cm}{1.258cm}}
\pgfpathcurveto{\pgfqpoint{0.173cm}{1.258cm}}{\pgfqpoint{0.207cm}{1.273cm}}{\pgfqpoint{0.233cm}{1.299cm}}
\pgfpathcurveto{\pgfqpoint{0.259cm}{1.324cm}}{\pgfqpoint{0.273cm}{1.359cm}}{\pgfqpoint{0.273cm}{1.395cm}}
\pgfusepath{fill}
\begin{pgfscope}
\pgfsetdash{}{0cm}
\pgfsetlinewidth{0.818mm}
\pgfsetmiterlimit{7.0}
\pgfpathmoveto{\pgfqpoint{0.682cm}{0.671cm}}
\pgfpathlineto{\pgfqpoint{0.679cm}{1.418cm}}
\pgfusepath{stroke}
\end{pgfscope}
\pgfpathmoveto{\pgfqpoint{0.815cm}{1.399cm}}
\pgfpathcurveto{\pgfqpoint{0.815cm}{1.435cm}}{\pgfqpoint{0.801cm}{1.47cm}}{\pgfqpoint{0.775cm}{1.496cm}}
\pgfpathcurveto{\pgfqpoint{0.75cm}{1.521cm}}{\pgfqpoint{0.715cm}{1.536cm}}{\pgfqpoint{0.679cm}{1.536cm}}
\pgfpathcurveto{\pgfqpoint{0.643cm}{1.536cm}}{\pgfqpoint{0.608cm}{1.521cm}}{\pgfqpoint{0.582cm}{1.496cm}}
\pgfpathcurveto{\pgfqpoint{0.557cm}{1.47cm}}{\pgfqpoint{0.542cm}{1.435cm}}{\pgfqpoint{0.542cm}{1.399cm}}
\pgfpathcurveto{\pgfqpoint{0.542cm}{1.363cm}}{\pgfqpoint{0.557cm}{1.328cm}}{\pgfqpoint{0.582cm}{1.302cm}}
\pgfpathcurveto{\pgfqpoint{0.608cm}{1.276cm}}{\pgfqpoint{0.643cm}{1.262cm}}{\pgfqpoint{0.679cm}{1.262cm}}
\pgfpathcurveto{\pgfqpoint{0.715cm}{1.262cm}}{\pgfqpoint{0.75cm}{1.276cm}}{\pgfqpoint{0.775cm}{1.302cm}}
\pgfpathcurveto{\pgfqpoint{0.801cm}{1.328cm}}{\pgfqpoint{0.815cm}{1.363cm}}{\pgfqpoint{0.815cm}{1.399cm}}
\pgfusepath{fill}
\pgfpathmoveto{\pgfqpoint{1.345cm}{1.371cm}}
\pgfpathcurveto{\pgfqpoint{1.345cm}{1.408cm}}{\pgfqpoint{1.331cm}{1.442cm}}{\pgfqpoint{1.305cm}{1.468cm}}
\pgfpathcurveto{\pgfqpoint{1.28cm}{1.494cm}}{\pgfqpoint{1.245cm}{1.508cm}}{\pgfqpoint{1.209cm}{1.508cm}}
\pgfpathcurveto{\pgfqpoint{1.172cm}{1.508cm}}{\pgfqpoint{1.138cm}{1.494cm}}{\pgfqpoint{1.112cm}{1.468cm}}
\pgfpathcurveto{\pgfqpoint{1.087cm}{1.442cm}}{\pgfqpoint{1.072cm}{1.408cm}}{\pgfqpoint{1.072cm}{1.371cm}}
\pgfpathcurveto{\pgfqpoint{1.072cm}{1.335cm}}{\pgfqpoint{1.087cm}{1.3cm}}{\pgfqpoint{1.112cm}{1.274cm}}
\pgfpathcurveto{\pgfqpoint{1.138cm}{1.249cm}}{\pgfqpoint{1.172cm}{1.234cm}}{\pgfqpoint{1.209cm}{1.234cm}}
\pgfpathcurveto{\pgfqpoint{1.245cm}{1.234cm}}{\pgfqpoint{1.28cm}{1.249cm}}{\pgfqpoint{1.305cm}{1.274cm}}
\pgfpathcurveto{\pgfqpoint{1.331cm}{1.3cm}}{\pgfqpoint{1.345cm}{1.335cm}}{\pgfqpoint{1.345cm}{1.371cm}}
\pgfusepath{fill}
\begin{pgfscope}
\pgfsetdash{}{0cm}
\pgfsetlinewidth{0.818mm}
\pgfsetroundcap
\pgfsetmiterlimit{4.0}
\pgfpathmoveto{\pgfqpoint{0.682cm}{0.671cm}}
\pgfpathlineto{\pgfqpoint{0.682cm}{0.042cm}}
\pgfusepath{stroke}
\end{pgfscope}
\end{pgfscope}
\end{pgfscope}
\end{pgfscope}
\end{tikzpicture}}},X)}\\
&\qquad+\rmb{3X\circ (X^{\!\resizebox{0.6em}{!}{
\begin{tikzpicture}
\pgfpathmoveto{\pgfqpoint{0cm}{-0.035cm}}
\pgfpathlineto{\pgfqpoint{1.376cm}{-0.035cm}}
\pgfpathlineto{\pgfqpoint{1.376cm}{1.552cm}}
\pgfpathlineto{\pgfqpoint{0cm}{1.552cm}}
\pgfpathclose
\pgfusepath{clip}
\begin{pgfscope}
\begin{pgfscope}
\pgfpathmoveto{\pgfqpoint{0cm}{-0.035cm}}
\pgfpathlineto{\pgfqpoint{1.376cm}{-0.035cm}}
\pgfpathlineto{\pgfqpoint{1.376cm}{1.552cm}}
\pgfpathlineto{\pgfqpoint{0cm}{1.552cm}}
\pgfpathclose
\pgfusepath{clip}
\begin{pgfscope}
\begin{pgfscope}
\pgfsetdash{}{0cm}
\pgfsetlinewidth{0.818mm}
\pgfsetroundcap
\pgfsetroundjoin
\pgfsetmiterlimit{7.0}
\definecolor{eps2pgf_color}{gray}{0}\pgfsetstrokecolor{eps2pgf_color}\pgfsetfillcolor{eps2pgf_color}
\pgfpathmoveto{\pgfqpoint{0.117cm}{1.421cm}}
\pgfpathlineto{\pgfqpoint{0.682cm}{0.671cm}}
\pgfpathlineto{\pgfqpoint{1.246cm}{1.421cm}}
\pgfusepath{stroke}
\end{pgfscope}
\definecolor{eps2pgf_color}{gray}{0}\pgfsetstrokecolor{eps2pgf_color}\pgfsetfillcolor{eps2pgf_color}
\pgfpathmoveto{\pgfqpoint{0.273cm}{1.395cm}}
\pgfpathcurveto{\pgfqpoint{0.273cm}{1.432cm}}{\pgfqpoint{0.259cm}{1.467cm}}{\pgfqpoint{0.233cm}{1.492cm}}
\pgfpathcurveto{\pgfqpoint{0.207cm}{1.518cm}}{\pgfqpoint{0.173cm}{1.532cm}}{\pgfqpoint{0.137cm}{1.532cm}}
\pgfpathcurveto{\pgfqpoint{0.1cm}{1.532cm}}{\pgfqpoint{0.066cm}{1.518cm}}{\pgfqpoint{0.04cm}{1.492cm}}
\pgfpathcurveto{\pgfqpoint{0.014cm}{1.467cm}}{\pgfqpoint{0cm}{1.432cm}}{\pgfqpoint{0cm}{1.395cm}}
\pgfpathcurveto{\pgfqpoint{0cm}{1.359cm}}{\pgfqpoint{0.014cm}{1.324cm}}{\pgfqpoint{0.04cm}{1.299cm}}
\pgfpathcurveto{\pgfqpoint{0.066cm}{1.273cm}}{\pgfqpoint{0.1cm}{1.258cm}}{\pgfqpoint{0.137cm}{1.258cm}}
\pgfpathcurveto{\pgfqpoint{0.173cm}{1.258cm}}{\pgfqpoint{0.207cm}{1.273cm}}{\pgfqpoint{0.233cm}{1.299cm}}
\pgfpathcurveto{\pgfqpoint{0.259cm}{1.324cm}}{\pgfqpoint{0.273cm}{1.359cm}}{\pgfqpoint{0.273cm}{1.395cm}}
\pgfusepath{fill}
\begin{pgfscope}
\pgfsetdash{}{0cm}
\pgfsetlinewidth{0.818mm}
\pgfsetmiterlimit{7.0}
\pgfpathmoveto{\pgfqpoint{0.682cm}{0.671cm}}
\pgfpathlineto{\pgfqpoint{0.679cm}{1.418cm}}
\pgfusepath{stroke}
\end{pgfscope}
\pgfpathmoveto{\pgfqpoint{0.815cm}{1.399cm}}
\pgfpathcurveto{\pgfqpoint{0.815cm}{1.435cm}}{\pgfqpoint{0.801cm}{1.47cm}}{\pgfqpoint{0.775cm}{1.496cm}}
\pgfpathcurveto{\pgfqpoint{0.75cm}{1.521cm}}{\pgfqpoint{0.715cm}{1.536cm}}{\pgfqpoint{0.679cm}{1.536cm}}
\pgfpathcurveto{\pgfqpoint{0.643cm}{1.536cm}}{\pgfqpoint{0.608cm}{1.521cm}}{\pgfqpoint{0.582cm}{1.496cm}}
\pgfpathcurveto{\pgfqpoint{0.557cm}{1.47cm}}{\pgfqpoint{0.542cm}{1.435cm}}{\pgfqpoint{0.542cm}{1.399cm}}
\pgfpathcurveto{\pgfqpoint{0.542cm}{1.363cm}}{\pgfqpoint{0.557cm}{1.328cm}}{\pgfqpoint{0.582cm}{1.302cm}}
\pgfpathcurveto{\pgfqpoint{0.608cm}{1.276cm}}{\pgfqpoint{0.643cm}{1.262cm}}{\pgfqpoint{0.679cm}{1.262cm}}
\pgfpathcurveto{\pgfqpoint{0.715cm}{1.262cm}}{\pgfqpoint{0.75cm}{1.276cm}}{\pgfqpoint{0.775cm}{1.302cm}}
\pgfpathcurveto{\pgfqpoint{0.801cm}{1.328cm}}{\pgfqpoint{0.815cm}{1.363cm}}{\pgfqpoint{0.815cm}{1.399cm}}
\pgfusepath{fill}
\pgfpathmoveto{\pgfqpoint{1.345cm}{1.371cm}}
\pgfpathcurveto{\pgfqpoint{1.345cm}{1.408cm}}{\pgfqpoint{1.331cm}{1.442cm}}{\pgfqpoint{1.305cm}{1.468cm}}
\pgfpathcurveto{\pgfqpoint{1.28cm}{1.494cm}}{\pgfqpoint{1.245cm}{1.508cm}}{\pgfqpoint{1.209cm}{1.508cm}}
\pgfpathcurveto{\pgfqpoint{1.172cm}{1.508cm}}{\pgfqpoint{1.138cm}{1.494cm}}{\pgfqpoint{1.112cm}{1.468cm}}
\pgfpathcurveto{\pgfqpoint{1.087cm}{1.442cm}}{\pgfqpoint{1.072cm}{1.408cm}}{\pgfqpoint{1.072cm}{1.371cm}}
\pgfpathcurveto{\pgfqpoint{1.072cm}{1.335cm}}{\pgfqpoint{1.087cm}{1.3cm}}{\pgfqpoint{1.112cm}{1.274cm}}
\pgfpathcurveto{\pgfqpoint{1.138cm}{1.249cm}}{\pgfqpoint{1.172cm}{1.234cm}}{\pgfqpoint{1.209cm}{1.234cm}}
\pgfpathcurveto{\pgfqpoint{1.245cm}{1.234cm}}{\pgfqpoint{1.28cm}{1.249cm}}{\pgfqpoint{1.305cm}{1.274cm}}
\pgfpathcurveto{\pgfqpoint{1.331cm}{1.3cm}}{\pgfqpoint{1.345cm}{1.335cm}}{\pgfqpoint{1.345cm}{1.371cm}}
\pgfusepath{fill}
\begin{pgfscope}
\pgfsetdash{}{0cm}
\pgfsetlinewidth{0.818mm}
\pgfsetroundcap
\pgfsetmiterlimit{4.0}
\pgfpathmoveto{\pgfqpoint{0.682cm}{0.671cm}}
\pgfpathlineto{\pgfqpoint{0.682cm}{0.042cm}}
\pgfusepath{stroke}
\end{pgfscope}
\end{pgfscope}
\end{pgfscope}
\end{pgfscope}
\end{tikzpicture}}}\circ X^{\!\resizebox{0.6em}{!}{
\begin{tikzpicture}
\pgfpathmoveto{\pgfqpoint{0cm}{-0.035cm}}
\pgfpathlineto{\pgfqpoint{1.376cm}{-0.035cm}}
\pgfpathlineto{\pgfqpoint{1.376cm}{1.552cm}}
\pgfpathlineto{\pgfqpoint{0cm}{1.552cm}}
\pgfpathclose
\pgfusepath{clip}
\begin{pgfscope}
\begin{pgfscope}
\pgfpathmoveto{\pgfqpoint{0cm}{-0.035cm}}
\pgfpathlineto{\pgfqpoint{1.376cm}{-0.035cm}}
\pgfpathlineto{\pgfqpoint{1.376cm}{1.552cm}}
\pgfpathlineto{\pgfqpoint{0cm}{1.552cm}}
\pgfpathclose
\pgfusepath{clip}
\begin{pgfscope}
\begin{pgfscope}
\pgfsetdash{}{0cm}
\pgfsetlinewidth{0.818mm}
\pgfsetroundcap
\pgfsetroundjoin
\pgfsetmiterlimit{7.0}
\definecolor{eps2pgf_color}{gray}{0}\pgfsetstrokecolor{eps2pgf_color}\pgfsetfillcolor{eps2pgf_color}
\pgfpathmoveto{\pgfqpoint{0.117cm}{1.421cm}}
\pgfpathlineto{\pgfqpoint{0.682cm}{0.671cm}}
\pgfpathlineto{\pgfqpoint{1.246cm}{1.421cm}}
\pgfusepath{stroke}
\end{pgfscope}
\definecolor{eps2pgf_color}{gray}{0}\pgfsetstrokecolor{eps2pgf_color}\pgfsetfillcolor{eps2pgf_color}
\pgfpathmoveto{\pgfqpoint{0.273cm}{1.395cm}}
\pgfpathcurveto{\pgfqpoint{0.273cm}{1.432cm}}{\pgfqpoint{0.259cm}{1.467cm}}{\pgfqpoint{0.233cm}{1.492cm}}
\pgfpathcurveto{\pgfqpoint{0.207cm}{1.518cm}}{\pgfqpoint{0.173cm}{1.532cm}}{\pgfqpoint{0.137cm}{1.532cm}}
\pgfpathcurveto{\pgfqpoint{0.1cm}{1.532cm}}{\pgfqpoint{0.066cm}{1.518cm}}{\pgfqpoint{0.04cm}{1.492cm}}
\pgfpathcurveto{\pgfqpoint{0.014cm}{1.467cm}}{\pgfqpoint{0cm}{1.432cm}}{\pgfqpoint{0cm}{1.395cm}}
\pgfpathcurveto{\pgfqpoint{0cm}{1.359cm}}{\pgfqpoint{0.014cm}{1.324cm}}{\pgfqpoint{0.04cm}{1.299cm}}
\pgfpathcurveto{\pgfqpoint{0.066cm}{1.273cm}}{\pgfqpoint{0.1cm}{1.258cm}}{\pgfqpoint{0.137cm}{1.258cm}}
\pgfpathcurveto{\pgfqpoint{0.173cm}{1.258cm}}{\pgfqpoint{0.207cm}{1.273cm}}{\pgfqpoint{0.233cm}{1.299cm}}
\pgfpathcurveto{\pgfqpoint{0.259cm}{1.324cm}}{\pgfqpoint{0.273cm}{1.359cm}}{\pgfqpoint{0.273cm}{1.395cm}}
\pgfusepath{fill}
\begin{pgfscope}
\pgfsetdash{}{0cm}
\pgfsetlinewidth{0.818mm}
\pgfsetmiterlimit{7.0}
\pgfpathmoveto{\pgfqpoint{0.682cm}{0.671cm}}
\pgfpathlineto{\pgfqpoint{0.679cm}{1.418cm}}
\pgfusepath{stroke}
\end{pgfscope}
\pgfpathmoveto{\pgfqpoint{0.815cm}{1.399cm}}
\pgfpathcurveto{\pgfqpoint{0.815cm}{1.435cm}}{\pgfqpoint{0.801cm}{1.47cm}}{\pgfqpoint{0.775cm}{1.496cm}}
\pgfpathcurveto{\pgfqpoint{0.75cm}{1.521cm}}{\pgfqpoint{0.715cm}{1.536cm}}{\pgfqpoint{0.679cm}{1.536cm}}
\pgfpathcurveto{\pgfqpoint{0.643cm}{1.536cm}}{\pgfqpoint{0.608cm}{1.521cm}}{\pgfqpoint{0.582cm}{1.496cm}}
\pgfpathcurveto{\pgfqpoint{0.557cm}{1.47cm}}{\pgfqpoint{0.542cm}{1.435cm}}{\pgfqpoint{0.542cm}{1.399cm}}
\pgfpathcurveto{\pgfqpoint{0.542cm}{1.363cm}}{\pgfqpoint{0.557cm}{1.328cm}}{\pgfqpoint{0.582cm}{1.302cm}}
\pgfpathcurveto{\pgfqpoint{0.608cm}{1.276cm}}{\pgfqpoint{0.643cm}{1.262cm}}{\pgfqpoint{0.679cm}{1.262cm}}
\pgfpathcurveto{\pgfqpoint{0.715cm}{1.262cm}}{\pgfqpoint{0.75cm}{1.276cm}}{\pgfqpoint{0.775cm}{1.302cm}}
\pgfpathcurveto{\pgfqpoint{0.801cm}{1.328cm}}{\pgfqpoint{0.815cm}{1.363cm}}{\pgfqpoint{0.815cm}{1.399cm}}
\pgfusepath{fill}
\pgfpathmoveto{\pgfqpoint{1.345cm}{1.371cm}}
\pgfpathcurveto{\pgfqpoint{1.345cm}{1.408cm}}{\pgfqpoint{1.331cm}{1.442cm}}{\pgfqpoint{1.305cm}{1.468cm}}
\pgfpathcurveto{\pgfqpoint{1.28cm}{1.494cm}}{\pgfqpoint{1.245cm}{1.508cm}}{\pgfqpoint{1.209cm}{1.508cm}}
\pgfpathcurveto{\pgfqpoint{1.172cm}{1.508cm}}{\pgfqpoint{1.138cm}{1.494cm}}{\pgfqpoint{1.112cm}{1.468cm}}
\pgfpathcurveto{\pgfqpoint{1.087cm}{1.442cm}}{\pgfqpoint{1.072cm}{1.408cm}}{\pgfqpoint{1.072cm}{1.371cm}}
\pgfpathcurveto{\pgfqpoint{1.072cm}{1.335cm}}{\pgfqpoint{1.087cm}{1.3cm}}{\pgfqpoint{1.112cm}{1.274cm}}
\pgfpathcurveto{\pgfqpoint{1.138cm}{1.249cm}}{\pgfqpoint{1.172cm}{1.234cm}}{\pgfqpoint{1.209cm}{1.234cm}}
\pgfpathcurveto{\pgfqpoint{1.245cm}{1.234cm}}{\pgfqpoint{1.28cm}{1.249cm}}{\pgfqpoint{1.305cm}{1.274cm}}
\pgfpathcurveto{\pgfqpoint{1.331cm}{1.3cm}}{\pgfqpoint{1.345cm}{1.335cm}}{\pgfqpoint{1.345cm}{1.371cm}}
\pgfusepath{fill}
\begin{pgfscope}
\pgfsetdash{}{0cm}
\pgfsetlinewidth{0.818mm}
\pgfsetroundcap
\pgfsetmiterlimit{4.0}
\pgfpathmoveto{\pgfqpoint{0.682cm}{0.671cm}}
\pgfpathlineto{\pgfqpoint{0.682cm}{0.042cm}}
\pgfusepath{stroke}
\end{pgfscope}
\end{pgfscope}
\end{pgfscope}
\end{pgfscope}
\end{tikzpicture}}})}-\rmb{6X\circ(X^{\!\resizebox{0.6em}{!}{
\begin{tikzpicture}
\pgfpathmoveto{\pgfqpoint{0cm}{-0.035cm}}
\pgfpathlineto{\pgfqpoint{1.376cm}{-0.035cm}}
\pgfpathlineto{\pgfqpoint{1.376cm}{1.552cm}}
\pgfpathlineto{\pgfqpoint{0cm}{1.552cm}}
\pgfpathclose
\pgfusepath{clip}
\begin{pgfscope}
\begin{pgfscope}
\pgfpathmoveto{\pgfqpoint{0cm}{-0.035cm}}
\pgfpathlineto{\pgfqpoint{1.376cm}{-0.035cm}}
\pgfpathlineto{\pgfqpoint{1.376cm}{1.552cm}}
\pgfpathlineto{\pgfqpoint{0cm}{1.552cm}}
\pgfpathclose
\pgfusepath{clip}
\begin{pgfscope}
\begin{pgfscope}
\pgfsetdash{}{0cm}
\pgfsetlinewidth{0.818mm}
\pgfsetroundcap
\pgfsetroundjoin
\pgfsetmiterlimit{7.0}
\definecolor{eps2pgf_color}{gray}{0}\pgfsetstrokecolor{eps2pgf_color}\pgfsetfillcolor{eps2pgf_color}
\pgfpathmoveto{\pgfqpoint{0.117cm}{1.421cm}}
\pgfpathlineto{\pgfqpoint{0.682cm}{0.671cm}}
\pgfpathlineto{\pgfqpoint{1.246cm}{1.421cm}}
\pgfusepath{stroke}
\end{pgfscope}
\definecolor{eps2pgf_color}{gray}{0}\pgfsetstrokecolor{eps2pgf_color}\pgfsetfillcolor{eps2pgf_color}
\pgfpathmoveto{\pgfqpoint{0.273cm}{1.395cm}}
\pgfpathcurveto{\pgfqpoint{0.273cm}{1.432cm}}{\pgfqpoint{0.259cm}{1.467cm}}{\pgfqpoint{0.233cm}{1.492cm}}
\pgfpathcurveto{\pgfqpoint{0.207cm}{1.518cm}}{\pgfqpoint{0.173cm}{1.532cm}}{\pgfqpoint{0.137cm}{1.532cm}}
\pgfpathcurveto{\pgfqpoint{0.1cm}{1.532cm}}{\pgfqpoint{0.066cm}{1.518cm}}{\pgfqpoint{0.04cm}{1.492cm}}
\pgfpathcurveto{\pgfqpoint{0.014cm}{1.467cm}}{\pgfqpoint{0cm}{1.432cm}}{\pgfqpoint{0cm}{1.395cm}}
\pgfpathcurveto{\pgfqpoint{0cm}{1.359cm}}{\pgfqpoint{0.014cm}{1.324cm}}{\pgfqpoint{0.04cm}{1.299cm}}
\pgfpathcurveto{\pgfqpoint{0.066cm}{1.273cm}}{\pgfqpoint{0.1cm}{1.258cm}}{\pgfqpoint{0.137cm}{1.258cm}}
\pgfpathcurveto{\pgfqpoint{0.173cm}{1.258cm}}{\pgfqpoint{0.207cm}{1.273cm}}{\pgfqpoint{0.233cm}{1.299cm}}
\pgfpathcurveto{\pgfqpoint{0.259cm}{1.324cm}}{\pgfqpoint{0.273cm}{1.359cm}}{\pgfqpoint{0.273cm}{1.395cm}}
\pgfusepath{fill}
\begin{pgfscope}
\pgfsetdash{}{0cm}
\pgfsetlinewidth{0.818mm}
\pgfsetmiterlimit{7.0}
\pgfpathmoveto{\pgfqpoint{0.682cm}{0.671cm}}
\pgfpathlineto{\pgfqpoint{0.679cm}{1.418cm}}
\pgfusepath{stroke}
\end{pgfscope}
\pgfpathmoveto{\pgfqpoint{0.815cm}{1.399cm}}
\pgfpathcurveto{\pgfqpoint{0.815cm}{1.435cm}}{\pgfqpoint{0.801cm}{1.47cm}}{\pgfqpoint{0.775cm}{1.496cm}}
\pgfpathcurveto{\pgfqpoint{0.75cm}{1.521cm}}{\pgfqpoint{0.715cm}{1.536cm}}{\pgfqpoint{0.679cm}{1.536cm}}
\pgfpathcurveto{\pgfqpoint{0.643cm}{1.536cm}}{\pgfqpoint{0.608cm}{1.521cm}}{\pgfqpoint{0.582cm}{1.496cm}}
\pgfpathcurveto{\pgfqpoint{0.557cm}{1.47cm}}{\pgfqpoint{0.542cm}{1.435cm}}{\pgfqpoint{0.542cm}{1.399cm}}
\pgfpathcurveto{\pgfqpoint{0.542cm}{1.363cm}}{\pgfqpoint{0.557cm}{1.328cm}}{\pgfqpoint{0.582cm}{1.302cm}}
\pgfpathcurveto{\pgfqpoint{0.608cm}{1.276cm}}{\pgfqpoint{0.643cm}{1.262cm}}{\pgfqpoint{0.679cm}{1.262cm}}
\pgfpathcurveto{\pgfqpoint{0.715cm}{1.262cm}}{\pgfqpoint{0.75cm}{1.276cm}}{\pgfqpoint{0.775cm}{1.302cm}}
\pgfpathcurveto{\pgfqpoint{0.801cm}{1.328cm}}{\pgfqpoint{0.815cm}{1.363cm}}{\pgfqpoint{0.815cm}{1.399cm}}
\pgfusepath{fill}
\pgfpathmoveto{\pgfqpoint{1.345cm}{1.371cm}}
\pgfpathcurveto{\pgfqpoint{1.345cm}{1.408cm}}{\pgfqpoint{1.331cm}{1.442cm}}{\pgfqpoint{1.305cm}{1.468cm}}
\pgfpathcurveto{\pgfqpoint{1.28cm}{1.494cm}}{\pgfqpoint{1.245cm}{1.508cm}}{\pgfqpoint{1.209cm}{1.508cm}}
\pgfpathcurveto{\pgfqpoint{1.172cm}{1.508cm}}{\pgfqpoint{1.138cm}{1.494cm}}{\pgfqpoint{1.112cm}{1.468cm}}
\pgfpathcurveto{\pgfqpoint{1.087cm}{1.442cm}}{\pgfqpoint{1.072cm}{1.408cm}}{\pgfqpoint{1.072cm}{1.371cm}}
\pgfpathcurveto{\pgfqpoint{1.072cm}{1.335cm}}{\pgfqpoint{1.087cm}{1.3cm}}{\pgfqpoint{1.112cm}{1.274cm}}
\pgfpathcurveto{\pgfqpoint{1.138cm}{1.249cm}}{\pgfqpoint{1.172cm}{1.234cm}}{\pgfqpoint{1.209cm}{1.234cm}}
\pgfpathcurveto{\pgfqpoint{1.245cm}{1.234cm}}{\pgfqpoint{1.28cm}{1.249cm}}{\pgfqpoint{1.305cm}{1.274cm}}
\pgfpathcurveto{\pgfqpoint{1.331cm}{1.3cm}}{\pgfqpoint{1.345cm}{1.335cm}}{\pgfqpoint{1.345cm}{1.371cm}}
\pgfusepath{fill}
\begin{pgfscope}
\pgfsetdash{}{0cm}
\pgfsetlinewidth{0.818mm}
\pgfsetroundcap
\pgfsetmiterlimit{4.0}
\pgfpathmoveto{\pgfqpoint{0.682cm}{0.671cm}}
\pgfpathlineto{\pgfqpoint{0.682cm}{0.042cm}}
\pgfusepath{stroke}
\end{pgfscope}
\end{pgfscope}
\end{pgfscope}
\end{pgfscope}
\end{tikzpicture}}}\preccurlyeq(\phi+\psi))}-\rmb{6\mathrm{com}(\phi+\psi,X^{\!\resizebox{0.6em}{!}{
\begin{tikzpicture}
\pgfpathmoveto{\pgfqpoint{0cm}{-0.035cm}}
\pgfpathlineto{\pgfqpoint{1.376cm}{-0.035cm}}
\pgfpathlineto{\pgfqpoint{1.376cm}{1.552cm}}
\pgfpathlineto{\pgfqpoint{0cm}{1.552cm}}
\pgfpathclose
\pgfusepath{clip}
\begin{pgfscope}
\begin{pgfscope}
\pgfpathmoveto{\pgfqpoint{0cm}{-0.035cm}}
\pgfpathlineto{\pgfqpoint{1.376cm}{-0.035cm}}
\pgfpathlineto{\pgfqpoint{1.376cm}{1.552cm}}
\pgfpathlineto{\pgfqpoint{0cm}{1.552cm}}
\pgfpathclose
\pgfusepath{clip}
\begin{pgfscope}
\begin{pgfscope}
\pgfsetdash{}{0cm}
\pgfsetlinewidth{0.818mm}
\pgfsetroundcap
\pgfsetroundjoin
\pgfsetmiterlimit{7.0}
\definecolor{eps2pgf_color}{gray}{0}\pgfsetstrokecolor{eps2pgf_color}\pgfsetfillcolor{eps2pgf_color}
\pgfpathmoveto{\pgfqpoint{0.117cm}{1.421cm}}
\pgfpathlineto{\pgfqpoint{0.682cm}{0.671cm}}
\pgfpathlineto{\pgfqpoint{1.246cm}{1.421cm}}
\pgfusepath{stroke}
\end{pgfscope}
\definecolor{eps2pgf_color}{gray}{0}\pgfsetstrokecolor{eps2pgf_color}\pgfsetfillcolor{eps2pgf_color}
\pgfpathmoveto{\pgfqpoint{0.273cm}{1.395cm}}
\pgfpathcurveto{\pgfqpoint{0.273cm}{1.432cm}}{\pgfqpoint{0.259cm}{1.467cm}}{\pgfqpoint{0.233cm}{1.492cm}}
\pgfpathcurveto{\pgfqpoint{0.207cm}{1.518cm}}{\pgfqpoint{0.173cm}{1.532cm}}{\pgfqpoint{0.137cm}{1.532cm}}
\pgfpathcurveto{\pgfqpoint{0.1cm}{1.532cm}}{\pgfqpoint{0.066cm}{1.518cm}}{\pgfqpoint{0.04cm}{1.492cm}}
\pgfpathcurveto{\pgfqpoint{0.014cm}{1.467cm}}{\pgfqpoint{0cm}{1.432cm}}{\pgfqpoint{0cm}{1.395cm}}
\pgfpathcurveto{\pgfqpoint{0cm}{1.359cm}}{\pgfqpoint{0.014cm}{1.324cm}}{\pgfqpoint{0.04cm}{1.299cm}}
\pgfpathcurveto{\pgfqpoint{0.066cm}{1.273cm}}{\pgfqpoint{0.1cm}{1.258cm}}{\pgfqpoint{0.137cm}{1.258cm}}
\pgfpathcurveto{\pgfqpoint{0.173cm}{1.258cm}}{\pgfqpoint{0.207cm}{1.273cm}}{\pgfqpoint{0.233cm}{1.299cm}}
\pgfpathcurveto{\pgfqpoint{0.259cm}{1.324cm}}{\pgfqpoint{0.273cm}{1.359cm}}{\pgfqpoint{0.273cm}{1.395cm}}
\pgfusepath{fill}
\begin{pgfscope}
\pgfsetdash{}{0cm}
\pgfsetlinewidth{0.818mm}
\pgfsetmiterlimit{7.0}
\pgfpathmoveto{\pgfqpoint{0.682cm}{0.671cm}}
\pgfpathlineto{\pgfqpoint{0.679cm}{1.418cm}}
\pgfusepath{stroke}
\end{pgfscope}
\pgfpathmoveto{\pgfqpoint{0.815cm}{1.399cm}}
\pgfpathcurveto{\pgfqpoint{0.815cm}{1.435cm}}{\pgfqpoint{0.801cm}{1.47cm}}{\pgfqpoint{0.775cm}{1.496cm}}
\pgfpathcurveto{\pgfqpoint{0.75cm}{1.521cm}}{\pgfqpoint{0.715cm}{1.536cm}}{\pgfqpoint{0.679cm}{1.536cm}}
\pgfpathcurveto{\pgfqpoint{0.643cm}{1.536cm}}{\pgfqpoint{0.608cm}{1.521cm}}{\pgfqpoint{0.582cm}{1.496cm}}
\pgfpathcurveto{\pgfqpoint{0.557cm}{1.47cm}}{\pgfqpoint{0.542cm}{1.435cm}}{\pgfqpoint{0.542cm}{1.399cm}}
\pgfpathcurveto{\pgfqpoint{0.542cm}{1.363cm}}{\pgfqpoint{0.557cm}{1.328cm}}{\pgfqpoint{0.582cm}{1.302cm}}
\pgfpathcurveto{\pgfqpoint{0.608cm}{1.276cm}}{\pgfqpoint{0.643cm}{1.262cm}}{\pgfqpoint{0.679cm}{1.262cm}}
\pgfpathcurveto{\pgfqpoint{0.715cm}{1.262cm}}{\pgfqpoint{0.75cm}{1.276cm}}{\pgfqpoint{0.775cm}{1.302cm}}
\pgfpathcurveto{\pgfqpoint{0.801cm}{1.328cm}}{\pgfqpoint{0.815cm}{1.363cm}}{\pgfqpoint{0.815cm}{1.399cm}}
\pgfusepath{fill}
\pgfpathmoveto{\pgfqpoint{1.345cm}{1.371cm}}
\pgfpathcurveto{\pgfqpoint{1.345cm}{1.408cm}}{\pgfqpoint{1.331cm}{1.442cm}}{\pgfqpoint{1.305cm}{1.468cm}}
\pgfpathcurveto{\pgfqpoint{1.28cm}{1.494cm}}{\pgfqpoint{1.245cm}{1.508cm}}{\pgfqpoint{1.209cm}{1.508cm}}
\pgfpathcurveto{\pgfqpoint{1.172cm}{1.508cm}}{\pgfqpoint{1.138cm}{1.494cm}}{\pgfqpoint{1.112cm}{1.468cm}}
\pgfpathcurveto{\pgfqpoint{1.087cm}{1.442cm}}{\pgfqpoint{1.072cm}{1.408cm}}{\pgfqpoint{1.072cm}{1.371cm}}
\pgfpathcurveto{\pgfqpoint{1.072cm}{1.335cm}}{\pgfqpoint{1.087cm}{1.3cm}}{\pgfqpoint{1.112cm}{1.274cm}}
\pgfpathcurveto{\pgfqpoint{1.138cm}{1.249cm}}{\pgfqpoint{1.172cm}{1.234cm}}{\pgfqpoint{1.209cm}{1.234cm}}
\pgfpathcurveto{\pgfqpoint{1.245cm}{1.234cm}}{\pgfqpoint{1.28cm}{1.249cm}}{\pgfqpoint{1.305cm}{1.274cm}}
\pgfpathcurveto{\pgfqpoint{1.331cm}{1.3cm}}{\pgfqpoint{1.345cm}{1.335cm}}{\pgfqpoint{1.345cm}{1.371cm}}
\pgfusepath{fill}
\begin{pgfscope}
\pgfsetdash{}{0cm}
\pgfsetlinewidth{0.818mm}
\pgfsetroundcap
\pgfsetmiterlimit{4.0}
\pgfpathmoveto{\pgfqpoint{0.682cm}{0.671cm}}
\pgfpathlineto{\pgfqpoint{0.682cm}{0.042cm}}
\pgfusepath{stroke}
\end{pgfscope}
\end{pgfscope}
\end{pgfscope}
\end{pgfscope}
\end{tikzpicture}}},X)}\\
&\qquad+\rmb{(-X^{\!\resizebox{0.6em}{!}{
\begin{tikzpicture}
\pgfpathmoveto{\pgfqpoint{0cm}{-0.035cm}}
\pgfpathlineto{\pgfqpoint{1.376cm}{-0.035cm}}
\pgfpathlineto{\pgfqpoint{1.376cm}{1.552cm}}
\pgfpathlineto{\pgfqpoint{0cm}{1.552cm}}
\pgfpathclose
\pgfusepath{clip}
\begin{pgfscope}
\begin{pgfscope}
\pgfpathmoveto{\pgfqpoint{0cm}{-0.035cm}}
\pgfpathlineto{\pgfqpoint{1.376cm}{-0.035cm}}
\pgfpathlineto{\pgfqpoint{1.376cm}{1.552cm}}
\pgfpathlineto{\pgfqpoint{0cm}{1.552cm}}
\pgfpathclose
\pgfusepath{clip}
\begin{pgfscope}
\begin{pgfscope}
\pgfsetdash{}{0cm}
\pgfsetlinewidth{0.818mm}
\pgfsetroundcap
\pgfsetroundjoin
\pgfsetmiterlimit{7.0}
\definecolor{eps2pgf_color}{gray}{0}\pgfsetstrokecolor{eps2pgf_color}\pgfsetfillcolor{eps2pgf_color}
\pgfpathmoveto{\pgfqpoint{0.117cm}{1.421cm}}
\pgfpathlineto{\pgfqpoint{0.682cm}{0.671cm}}
\pgfpathlineto{\pgfqpoint{1.246cm}{1.421cm}}
\pgfusepath{stroke}
\end{pgfscope}
\definecolor{eps2pgf_color}{gray}{0}\pgfsetstrokecolor{eps2pgf_color}\pgfsetfillcolor{eps2pgf_color}
\pgfpathmoveto{\pgfqpoint{0.273cm}{1.395cm}}
\pgfpathcurveto{\pgfqpoint{0.273cm}{1.432cm}}{\pgfqpoint{0.259cm}{1.467cm}}{\pgfqpoint{0.233cm}{1.492cm}}
\pgfpathcurveto{\pgfqpoint{0.207cm}{1.518cm}}{\pgfqpoint{0.173cm}{1.532cm}}{\pgfqpoint{0.137cm}{1.532cm}}
\pgfpathcurveto{\pgfqpoint{0.1cm}{1.532cm}}{\pgfqpoint{0.066cm}{1.518cm}}{\pgfqpoint{0.04cm}{1.492cm}}
\pgfpathcurveto{\pgfqpoint{0.014cm}{1.467cm}}{\pgfqpoint{0cm}{1.432cm}}{\pgfqpoint{0cm}{1.395cm}}
\pgfpathcurveto{\pgfqpoint{0cm}{1.359cm}}{\pgfqpoint{0.014cm}{1.324cm}}{\pgfqpoint{0.04cm}{1.299cm}}
\pgfpathcurveto{\pgfqpoint{0.066cm}{1.273cm}}{\pgfqpoint{0.1cm}{1.258cm}}{\pgfqpoint{0.137cm}{1.258cm}}
\pgfpathcurveto{\pgfqpoint{0.173cm}{1.258cm}}{\pgfqpoint{0.207cm}{1.273cm}}{\pgfqpoint{0.233cm}{1.299cm}}
\pgfpathcurveto{\pgfqpoint{0.259cm}{1.324cm}}{\pgfqpoint{0.273cm}{1.359cm}}{\pgfqpoint{0.273cm}{1.395cm}}
\pgfusepath{fill}
\begin{pgfscope}
\pgfsetdash{}{0cm}
\pgfsetlinewidth{0.818mm}
\pgfsetmiterlimit{7.0}
\pgfpathmoveto{\pgfqpoint{0.682cm}{0.671cm}}
\pgfpathlineto{\pgfqpoint{0.679cm}{1.418cm}}
\pgfusepath{stroke}
\end{pgfscope}
\pgfpathmoveto{\pgfqpoint{0.815cm}{1.399cm}}
\pgfpathcurveto{\pgfqpoint{0.815cm}{1.435cm}}{\pgfqpoint{0.801cm}{1.47cm}}{\pgfqpoint{0.775cm}{1.496cm}}
\pgfpathcurveto{\pgfqpoint{0.75cm}{1.521cm}}{\pgfqpoint{0.715cm}{1.536cm}}{\pgfqpoint{0.679cm}{1.536cm}}
\pgfpathcurveto{\pgfqpoint{0.643cm}{1.536cm}}{\pgfqpoint{0.608cm}{1.521cm}}{\pgfqpoint{0.582cm}{1.496cm}}
\pgfpathcurveto{\pgfqpoint{0.557cm}{1.47cm}}{\pgfqpoint{0.542cm}{1.435cm}}{\pgfqpoint{0.542cm}{1.399cm}}
\pgfpathcurveto{\pgfqpoint{0.542cm}{1.363cm}}{\pgfqpoint{0.557cm}{1.328cm}}{\pgfqpoint{0.582cm}{1.302cm}}
\pgfpathcurveto{\pgfqpoint{0.608cm}{1.276cm}}{\pgfqpoint{0.643cm}{1.262cm}}{\pgfqpoint{0.679cm}{1.262cm}}
\pgfpathcurveto{\pgfqpoint{0.715cm}{1.262cm}}{\pgfqpoint{0.75cm}{1.276cm}}{\pgfqpoint{0.775cm}{1.302cm}}
\pgfpathcurveto{\pgfqpoint{0.801cm}{1.328cm}}{\pgfqpoint{0.815cm}{1.363cm}}{\pgfqpoint{0.815cm}{1.399cm}}
\pgfusepath{fill}
\pgfpathmoveto{\pgfqpoint{1.345cm}{1.371cm}}
\pgfpathcurveto{\pgfqpoint{1.345cm}{1.408cm}}{\pgfqpoint{1.331cm}{1.442cm}}{\pgfqpoint{1.305cm}{1.468cm}}
\pgfpathcurveto{\pgfqpoint{1.28cm}{1.494cm}}{\pgfqpoint{1.245cm}{1.508cm}}{\pgfqpoint{1.209cm}{1.508cm}}
\pgfpathcurveto{\pgfqpoint{1.172cm}{1.508cm}}{\pgfqpoint{1.138cm}{1.494cm}}{\pgfqpoint{1.112cm}{1.468cm}}
\pgfpathcurveto{\pgfqpoint{1.087cm}{1.442cm}}{\pgfqpoint{1.072cm}{1.408cm}}{\pgfqpoint{1.072cm}{1.371cm}}
\pgfpathcurveto{\pgfqpoint{1.072cm}{1.335cm}}{\pgfqpoint{1.087cm}{1.3cm}}{\pgfqpoint{1.112cm}{1.274cm}}
\pgfpathcurveto{\pgfqpoint{1.138cm}{1.249cm}}{\pgfqpoint{1.172cm}{1.234cm}}{\pgfqpoint{1.209cm}{1.234cm}}
\pgfpathcurveto{\pgfqpoint{1.245cm}{1.234cm}}{\pgfqpoint{1.28cm}{1.249cm}}{\pgfqpoint{1.305cm}{1.274cm}}
\pgfpathcurveto{\pgfqpoint{1.331cm}{1.3cm}}{\pgfqpoint{1.345cm}{1.335cm}}{\pgfqpoint{1.345cm}{1.371cm}}
\pgfusepath{fill}
\begin{pgfscope}
\pgfsetdash{}{0cm}
\pgfsetlinewidth{0.818mm}
\pgfsetroundcap
\pgfsetmiterlimit{4.0}
\pgfpathmoveto{\pgfqpoint{0.682cm}{0.671cm}}
\pgfpathlineto{\pgfqpoint{0.682cm}{0.042cm}}
\pgfusepath{stroke}
\end{pgfscope}
\end{pgfscope}
\end{pgfscope}
\end{pgfscope}
\end{tikzpicture}}}+\phi)^3}+\rmb{3(-X^{\!\resizebox{0.6em}{!}{
\begin{tikzpicture}
\pgfpathmoveto{\pgfqpoint{0cm}{-0.035cm}}
\pgfpathlineto{\pgfqpoint{1.376cm}{-0.035cm}}
\pgfpathlineto{\pgfqpoint{1.376cm}{1.552cm}}
\pgfpathlineto{\pgfqpoint{0cm}{1.552cm}}
\pgfpathclose
\pgfusepath{clip}
\begin{pgfscope}
\begin{pgfscope}
\pgfpathmoveto{\pgfqpoint{0cm}{-0.035cm}}
\pgfpathlineto{\pgfqpoint{1.376cm}{-0.035cm}}
\pgfpathlineto{\pgfqpoint{1.376cm}{1.552cm}}
\pgfpathlineto{\pgfqpoint{0cm}{1.552cm}}
\pgfpathclose
\pgfusepath{clip}
\begin{pgfscope}
\begin{pgfscope}
\pgfsetdash{}{0cm}
\pgfsetlinewidth{0.818mm}
\pgfsetroundcap
\pgfsetroundjoin
\pgfsetmiterlimit{7.0}
\definecolor{eps2pgf_color}{gray}{0}\pgfsetstrokecolor{eps2pgf_color}\pgfsetfillcolor{eps2pgf_color}
\pgfpathmoveto{\pgfqpoint{0.117cm}{1.421cm}}
\pgfpathlineto{\pgfqpoint{0.682cm}{0.671cm}}
\pgfpathlineto{\pgfqpoint{1.246cm}{1.421cm}}
\pgfusepath{stroke}
\end{pgfscope}
\definecolor{eps2pgf_color}{gray}{0}\pgfsetstrokecolor{eps2pgf_color}\pgfsetfillcolor{eps2pgf_color}
\pgfpathmoveto{\pgfqpoint{0.273cm}{1.395cm}}
\pgfpathcurveto{\pgfqpoint{0.273cm}{1.432cm}}{\pgfqpoint{0.259cm}{1.467cm}}{\pgfqpoint{0.233cm}{1.492cm}}
\pgfpathcurveto{\pgfqpoint{0.207cm}{1.518cm}}{\pgfqpoint{0.173cm}{1.532cm}}{\pgfqpoint{0.137cm}{1.532cm}}
\pgfpathcurveto{\pgfqpoint{0.1cm}{1.532cm}}{\pgfqpoint{0.066cm}{1.518cm}}{\pgfqpoint{0.04cm}{1.492cm}}
\pgfpathcurveto{\pgfqpoint{0.014cm}{1.467cm}}{\pgfqpoint{0cm}{1.432cm}}{\pgfqpoint{0cm}{1.395cm}}
\pgfpathcurveto{\pgfqpoint{0cm}{1.359cm}}{\pgfqpoint{0.014cm}{1.324cm}}{\pgfqpoint{0.04cm}{1.299cm}}
\pgfpathcurveto{\pgfqpoint{0.066cm}{1.273cm}}{\pgfqpoint{0.1cm}{1.258cm}}{\pgfqpoint{0.137cm}{1.258cm}}
\pgfpathcurveto{\pgfqpoint{0.173cm}{1.258cm}}{\pgfqpoint{0.207cm}{1.273cm}}{\pgfqpoint{0.233cm}{1.299cm}}
\pgfpathcurveto{\pgfqpoint{0.259cm}{1.324cm}}{\pgfqpoint{0.273cm}{1.359cm}}{\pgfqpoint{0.273cm}{1.395cm}}
\pgfusepath{fill}
\begin{pgfscope}
\pgfsetdash{}{0cm}
\pgfsetlinewidth{0.818mm}
\pgfsetmiterlimit{7.0}
\pgfpathmoveto{\pgfqpoint{0.682cm}{0.671cm}}
\pgfpathlineto{\pgfqpoint{0.679cm}{1.418cm}}
\pgfusepath{stroke}
\end{pgfscope}
\pgfpathmoveto{\pgfqpoint{0.815cm}{1.399cm}}
\pgfpathcurveto{\pgfqpoint{0.815cm}{1.435cm}}{\pgfqpoint{0.801cm}{1.47cm}}{\pgfqpoint{0.775cm}{1.496cm}}
\pgfpathcurveto{\pgfqpoint{0.75cm}{1.521cm}}{\pgfqpoint{0.715cm}{1.536cm}}{\pgfqpoint{0.679cm}{1.536cm}}
\pgfpathcurveto{\pgfqpoint{0.643cm}{1.536cm}}{\pgfqpoint{0.608cm}{1.521cm}}{\pgfqpoint{0.582cm}{1.496cm}}
\pgfpathcurveto{\pgfqpoint{0.557cm}{1.47cm}}{\pgfqpoint{0.542cm}{1.435cm}}{\pgfqpoint{0.542cm}{1.399cm}}
\pgfpathcurveto{\pgfqpoint{0.542cm}{1.363cm}}{\pgfqpoint{0.557cm}{1.328cm}}{\pgfqpoint{0.582cm}{1.302cm}}
\pgfpathcurveto{\pgfqpoint{0.608cm}{1.276cm}}{\pgfqpoint{0.643cm}{1.262cm}}{\pgfqpoint{0.679cm}{1.262cm}}
\pgfpathcurveto{\pgfqpoint{0.715cm}{1.262cm}}{\pgfqpoint{0.75cm}{1.276cm}}{\pgfqpoint{0.775cm}{1.302cm}}
\pgfpathcurveto{\pgfqpoint{0.801cm}{1.328cm}}{\pgfqpoint{0.815cm}{1.363cm}}{\pgfqpoint{0.815cm}{1.399cm}}
\pgfusepath{fill}
\pgfpathmoveto{\pgfqpoint{1.345cm}{1.371cm}}
\pgfpathcurveto{\pgfqpoint{1.345cm}{1.408cm}}{\pgfqpoint{1.331cm}{1.442cm}}{\pgfqpoint{1.305cm}{1.468cm}}
\pgfpathcurveto{\pgfqpoint{1.28cm}{1.494cm}}{\pgfqpoint{1.245cm}{1.508cm}}{\pgfqpoint{1.209cm}{1.508cm}}
\pgfpathcurveto{\pgfqpoint{1.172cm}{1.508cm}}{\pgfqpoint{1.138cm}{1.494cm}}{\pgfqpoint{1.112cm}{1.468cm}}
\pgfpathcurveto{\pgfqpoint{1.087cm}{1.442cm}}{\pgfqpoint{1.072cm}{1.408cm}}{\pgfqpoint{1.072cm}{1.371cm}}
\pgfpathcurveto{\pgfqpoint{1.072cm}{1.335cm}}{\pgfqpoint{1.087cm}{1.3cm}}{\pgfqpoint{1.112cm}{1.274cm}}
\pgfpathcurveto{\pgfqpoint{1.138cm}{1.249cm}}{\pgfqpoint{1.172cm}{1.234cm}}{\pgfqpoint{1.209cm}{1.234cm}}
\pgfpathcurveto{\pgfqpoint{1.245cm}{1.234cm}}{\pgfqpoint{1.28cm}{1.249cm}}{\pgfqpoint{1.305cm}{1.274cm}}
\pgfpathcurveto{\pgfqpoint{1.331cm}{1.3cm}}{\pgfqpoint{1.345cm}{1.335cm}}{\pgfqpoint{1.345cm}{1.371cm}}
\pgfusepath{fill}
\begin{pgfscope}
\pgfsetdash{}{0cm}
\pgfsetlinewidth{0.818mm}
\pgfsetroundcap
\pgfsetmiterlimit{4.0}
\pgfpathmoveto{\pgfqpoint{0.682cm}{0.671cm}}
\pgfpathlineto{\pgfqpoint{0.682cm}{0.042cm}}
\pgfusepath{stroke}
\end{pgfscope}
\end{pgfscope}
\end{pgfscope}
\end{pgfscope}
\end{tikzpicture}}}+\phi)^2\psi}+\rmb{3(-X^{\!\resizebox{0.6em}{!}{
\begin{tikzpicture}
\pgfpathmoveto{\pgfqpoint{0cm}{-0.035cm}}
\pgfpathlineto{\pgfqpoint{1.376cm}{-0.035cm}}
\pgfpathlineto{\pgfqpoint{1.376cm}{1.552cm}}
\pgfpathlineto{\pgfqpoint{0cm}{1.552cm}}
\pgfpathclose
\pgfusepath{clip}
\begin{pgfscope}
\begin{pgfscope}
\pgfpathmoveto{\pgfqpoint{0cm}{-0.035cm}}
\pgfpathlineto{\pgfqpoint{1.376cm}{-0.035cm}}
\pgfpathlineto{\pgfqpoint{1.376cm}{1.552cm}}
\pgfpathlineto{\pgfqpoint{0cm}{1.552cm}}
\pgfpathclose
\pgfusepath{clip}
\begin{pgfscope}
\begin{pgfscope}
\pgfsetdash{}{0cm}
\pgfsetlinewidth{0.818mm}
\pgfsetroundcap
\pgfsetroundjoin
\pgfsetmiterlimit{7.0}
\definecolor{eps2pgf_color}{gray}{0}\pgfsetstrokecolor{eps2pgf_color}\pgfsetfillcolor{eps2pgf_color}
\pgfpathmoveto{\pgfqpoint{0.117cm}{1.421cm}}
\pgfpathlineto{\pgfqpoint{0.682cm}{0.671cm}}
\pgfpathlineto{\pgfqpoint{1.246cm}{1.421cm}}
\pgfusepath{stroke}
\end{pgfscope}
\definecolor{eps2pgf_color}{gray}{0}\pgfsetstrokecolor{eps2pgf_color}\pgfsetfillcolor{eps2pgf_color}
\pgfpathmoveto{\pgfqpoint{0.273cm}{1.395cm}}
\pgfpathcurveto{\pgfqpoint{0.273cm}{1.432cm}}{\pgfqpoint{0.259cm}{1.467cm}}{\pgfqpoint{0.233cm}{1.492cm}}
\pgfpathcurveto{\pgfqpoint{0.207cm}{1.518cm}}{\pgfqpoint{0.173cm}{1.532cm}}{\pgfqpoint{0.137cm}{1.532cm}}
\pgfpathcurveto{\pgfqpoint{0.1cm}{1.532cm}}{\pgfqpoint{0.066cm}{1.518cm}}{\pgfqpoint{0.04cm}{1.492cm}}
\pgfpathcurveto{\pgfqpoint{0.014cm}{1.467cm}}{\pgfqpoint{0cm}{1.432cm}}{\pgfqpoint{0cm}{1.395cm}}
\pgfpathcurveto{\pgfqpoint{0cm}{1.359cm}}{\pgfqpoint{0.014cm}{1.324cm}}{\pgfqpoint{0.04cm}{1.299cm}}
\pgfpathcurveto{\pgfqpoint{0.066cm}{1.273cm}}{\pgfqpoint{0.1cm}{1.258cm}}{\pgfqpoint{0.137cm}{1.258cm}}
\pgfpathcurveto{\pgfqpoint{0.173cm}{1.258cm}}{\pgfqpoint{0.207cm}{1.273cm}}{\pgfqpoint{0.233cm}{1.299cm}}
\pgfpathcurveto{\pgfqpoint{0.259cm}{1.324cm}}{\pgfqpoint{0.273cm}{1.359cm}}{\pgfqpoint{0.273cm}{1.395cm}}
\pgfusepath{fill}
\begin{pgfscope}
\pgfsetdash{}{0cm}
\pgfsetlinewidth{0.818mm}
\pgfsetmiterlimit{7.0}
\pgfpathmoveto{\pgfqpoint{0.682cm}{0.671cm}}
\pgfpathlineto{\pgfqpoint{0.679cm}{1.418cm}}
\pgfusepath{stroke}
\end{pgfscope}
\pgfpathmoveto{\pgfqpoint{0.815cm}{1.399cm}}
\pgfpathcurveto{\pgfqpoint{0.815cm}{1.435cm}}{\pgfqpoint{0.801cm}{1.47cm}}{\pgfqpoint{0.775cm}{1.496cm}}
\pgfpathcurveto{\pgfqpoint{0.75cm}{1.521cm}}{\pgfqpoint{0.715cm}{1.536cm}}{\pgfqpoint{0.679cm}{1.536cm}}
\pgfpathcurveto{\pgfqpoint{0.643cm}{1.536cm}}{\pgfqpoint{0.608cm}{1.521cm}}{\pgfqpoint{0.582cm}{1.496cm}}
\pgfpathcurveto{\pgfqpoint{0.557cm}{1.47cm}}{\pgfqpoint{0.542cm}{1.435cm}}{\pgfqpoint{0.542cm}{1.399cm}}
\pgfpathcurveto{\pgfqpoint{0.542cm}{1.363cm}}{\pgfqpoint{0.557cm}{1.328cm}}{\pgfqpoint{0.582cm}{1.302cm}}
\pgfpathcurveto{\pgfqpoint{0.608cm}{1.276cm}}{\pgfqpoint{0.643cm}{1.262cm}}{\pgfqpoint{0.679cm}{1.262cm}}
\pgfpathcurveto{\pgfqpoint{0.715cm}{1.262cm}}{\pgfqpoint{0.75cm}{1.276cm}}{\pgfqpoint{0.775cm}{1.302cm}}
\pgfpathcurveto{\pgfqpoint{0.801cm}{1.328cm}}{\pgfqpoint{0.815cm}{1.363cm}}{\pgfqpoint{0.815cm}{1.399cm}}
\pgfusepath{fill}
\pgfpathmoveto{\pgfqpoint{1.345cm}{1.371cm}}
\pgfpathcurveto{\pgfqpoint{1.345cm}{1.408cm}}{\pgfqpoint{1.331cm}{1.442cm}}{\pgfqpoint{1.305cm}{1.468cm}}
\pgfpathcurveto{\pgfqpoint{1.28cm}{1.494cm}}{\pgfqpoint{1.245cm}{1.508cm}}{\pgfqpoint{1.209cm}{1.508cm}}
\pgfpathcurveto{\pgfqpoint{1.172cm}{1.508cm}}{\pgfqpoint{1.138cm}{1.494cm}}{\pgfqpoint{1.112cm}{1.468cm}}
\pgfpathcurveto{\pgfqpoint{1.087cm}{1.442cm}}{\pgfqpoint{1.072cm}{1.408cm}}{\pgfqpoint{1.072cm}{1.371cm}}
\pgfpathcurveto{\pgfqpoint{1.072cm}{1.335cm}}{\pgfqpoint{1.087cm}{1.3cm}}{\pgfqpoint{1.112cm}{1.274cm}}
\pgfpathcurveto{\pgfqpoint{1.138cm}{1.249cm}}{\pgfqpoint{1.172cm}{1.234cm}}{\pgfqpoint{1.209cm}{1.234cm}}
\pgfpathcurveto{\pgfqpoint{1.245cm}{1.234cm}}{\pgfqpoint{1.28cm}{1.249cm}}{\pgfqpoint{1.305cm}{1.274cm}}
\pgfpathcurveto{\pgfqpoint{1.331cm}{1.3cm}}{\pgfqpoint{1.345cm}{1.335cm}}{\pgfqpoint{1.345cm}{1.371cm}}
\pgfusepath{fill}
\begin{pgfscope}
\pgfsetdash{}{0cm}
\pgfsetlinewidth{0.818mm}
\pgfsetroundcap
\pgfsetmiterlimit{4.0}
\pgfpathmoveto{\pgfqpoint{0.682cm}{0.671cm}}
\pgfpathlineto{\pgfqpoint{0.682cm}{0.042cm}}
\pgfusepath{stroke}
\end{pgfscope}
\end{pgfscope}
\end{pgfscope}
\end{pgfscope}
\end{tikzpicture}}}+\phi)\psi^2}\\
&\qquad +\rmb{3\UU_\leq\llbracket X^2 \rrbracket\succ(\phi+\psi)}
+\rmb{3\UU_\leqslant\llbracket X^2 \rrbracket\prec(\phi+\psi)}-\rmb{3( \phi + \psi)\prec\UU_\leqslantX^{\!\resizebox{!}{.8em}{
\begin{tikzpicture}
\pgfpathmoveto{\pgfqpoint{0cm}{-0.035cm}}
\pgfpathlineto{\pgfqpoint{1.976cm}{-0.035cm}}
\pgfpathlineto{\pgfqpoint{1.976cm}{1.94cm}}
\pgfpathlineto{\pgfqpoint{0cm}{1.94cm}}
\pgfpathclose
\pgfusepath{clip}
\begin{pgfscope}
\begin{pgfscope}
\pgfpathmoveto{\pgfqpoint{0cm}{-0.035cm}}
\pgfpathlineto{\pgfqpoint{1.976cm}{-0.035cm}}
\pgfpathlineto{\pgfqpoint{1.976cm}{1.94cm}}
\pgfpathlineto{\pgfqpoint{0cm}{1.94cm}}
\pgfpathclose
\pgfusepath{clip}
\begin{pgfscope}
\begin{pgfscope}
\pgfsetdash{}{0cm}
\pgfsetlinewidth{0.818mm}
\pgfsetroundcap
\pgfsetroundjoin
\pgfsetmiterlimit{7.0}
\definecolor{eps2pgf_color}{gray}{0}\pgfsetstrokecolor{eps2pgf_color}\pgfsetfillcolor{eps2pgf_color}
\pgfpathmoveto{\pgfqpoint{0.117cm}{1.815cm}}
\pgfpathlineto{\pgfqpoint{0.682cm}{1.065cm}}
\pgfpathlineto{\pgfqpoint{1.246cm}{1.815cm}}
\pgfusepath{stroke}
\end{pgfscope}
\definecolor{eps2pgf_color}{gray}{0}\pgfsetstrokecolor{eps2pgf_color}\pgfsetfillcolor{eps2pgf_color}
\pgfpathmoveto{\pgfqpoint{0.273cm}{1.789cm}}
\pgfpathcurveto{\pgfqpoint{0.273cm}{1.825cm}}{\pgfqpoint{0.259cm}{1.86cm}}{\pgfqpoint{0.233cm}{1.886cm}}
\pgfpathcurveto{\pgfqpoint{0.207cm}{1.912cm}}{\pgfqpoint{0.173cm}{1.926cm}}{\pgfqpoint{0.137cm}{1.926cm}}
\pgfpathcurveto{\pgfqpoint{0.1cm}{1.926cm}}{\pgfqpoint{0.066cm}{1.912cm}}{\pgfqpoint{0.04cm}{1.886cm}}
\pgfpathcurveto{\pgfqpoint{0.014cm}{1.86cm}}{\pgfqpoint{0cm}{1.825cm}}{\pgfqpoint{0cm}{1.789cm}}
\pgfpathcurveto{\pgfqpoint{0cm}{1.753cm}}{\pgfqpoint{0.014cm}{1.718cm}}{\pgfqpoint{0.04cm}{1.692cm}}
\pgfpathcurveto{\pgfqpoint{0.066cm}{1.667cm}}{\pgfqpoint{0.1cm}{1.652cm}}{\pgfqpoint{0.137cm}{1.652cm}}
\pgfpathcurveto{\pgfqpoint{0.173cm}{1.652cm}}{\pgfqpoint{0.207cm}{1.667cm}}{\pgfqpoint{0.233cm}{1.692cm}}
\pgfpathcurveto{\pgfqpoint{0.259cm}{1.718cm}}{\pgfqpoint{0.273cm}{1.753cm}}{\pgfqpoint{0.273cm}{1.789cm}}
\pgfusepath{fill}
\pgfpathmoveto{\pgfqpoint{1.345cm}{1.765cm}}
\pgfpathcurveto{\pgfqpoint{1.345cm}{1.801cm}}{\pgfqpoint{1.331cm}{1.836cm}}{\pgfqpoint{1.305cm}{1.862cm}}
\pgfpathcurveto{\pgfqpoint{1.28cm}{1.887cm}}{\pgfqpoint{1.245cm}{1.902cm}}{\pgfqpoint{1.209cm}{1.902cm}}
\pgfpathcurveto{\pgfqpoint{1.172cm}{1.902cm}}{\pgfqpoint{1.138cm}{1.887cm}}{\pgfqpoint{1.112cm}{1.862cm}}
\pgfpathcurveto{\pgfqpoint{1.087cm}{1.836cm}}{\pgfqpoint{1.072cm}{1.801cm}}{\pgfqpoint{1.072cm}{1.765cm}}
\pgfpathcurveto{\pgfqpoint{1.072cm}{1.728cm}}{\pgfqpoint{1.087cm}{1.694cm}}{\pgfqpoint{1.112cm}{1.668cm}}
\pgfpathcurveto{\pgfqpoint{1.138cm}{1.642cm}}{\pgfqpoint{1.172cm}{1.628cm}}{\pgfqpoint{1.209cm}{1.628cm}}
\pgfpathcurveto{\pgfqpoint{1.245cm}{1.628cm}}{\pgfqpoint{1.28cm}{1.642cm}}{\pgfqpoint{1.305cm}{1.668cm}}
\pgfpathcurveto{\pgfqpoint{1.331cm}{1.694cm}}{\pgfqpoint{1.345cm}{1.728cm}}{\pgfqpoint{1.345cm}{1.765cm}}
\pgfusepath{fill}
\begin{pgfscope}
\pgfsetdash{}{0cm}
\pgfsetlinewidth{0.818mm}
\pgfsetroundcap
\pgfsetroundjoin
\pgfsetmiterlimit{7.0}
\pgfpathmoveto{\pgfqpoint{0.682cm}{1.065cm}}
\pgfpathlineto{\pgfqpoint{1.246cm}{0.315cm}}
\pgfpathlineto{\pgfqpoint{1.811cm}{1.065cm}}
\pgfusepath{stroke}
\end{pgfscope}
\pgfpathmoveto{\pgfqpoint{1.948cm}{1.065cm}}
\pgfpathcurveto{\pgfqpoint{1.948cm}{1.101cm}}{\pgfqpoint{1.933cm}{1.136cm}}{\pgfqpoint{1.907cm}{1.162cm}}
\pgfpathcurveto{\pgfqpoint{1.882cm}{1.187cm}}{\pgfqpoint{1.847cm}{1.202cm}}{\pgfqpoint{1.811cm}{1.202cm}}
\pgfpathcurveto{\pgfqpoint{1.775cm}{1.202cm}}{\pgfqpoint{1.74cm}{1.187cm}}{\pgfqpoint{1.714cm}{1.162cm}}
\pgfpathcurveto{\pgfqpoint{1.689cm}{1.136cm}}{\pgfqpoint{1.674cm}{1.101cm}}{\pgfqpoint{1.674cm}{1.065cm}}
\pgfpathcurveto{\pgfqpoint{1.674cm}{1.029cm}}{\pgfqpoint{1.689cm}{0.994cm}}{\pgfqpoint{1.714cm}{0.968cm}}
\pgfpathcurveto{\pgfqpoint{1.74cm}{0.942cm}}{\pgfqpoint{1.775cm}{0.928cm}}{\pgfqpoint{1.811cm}{0.928cm}}
\pgfpathcurveto{\pgfqpoint{1.847cm}{0.928cm}}{\pgfqpoint{1.882cm}{0.942cm}}{\pgfqpoint{1.907cm}{0.968cm}}
\pgfpathcurveto{\pgfqpoint{1.933cm}{0.994cm}}{\pgfqpoint{1.948cm}{1.029cm}}{\pgfqpoint{1.948cm}{1.065cm}}
\pgfusepath{fill}
\begin{pgfscope}
\pgfsetdash{}{0cm}
\pgfsetlinewidth{0.818mm}
\pgfsetmiterlimit{7.0}
\pgfpathmoveto{\pgfqpoint{1.246cm}{0.315cm}}
\pgfpathlineto{\pgfqpoint{1.244cm}{1.061cm}}
\pgfusepath{stroke}
\end{pgfscope}
\pgfpathmoveto{\pgfqpoint{1.38cm}{1.065cm}}
\pgfpathcurveto{\pgfqpoint{1.38cm}{1.101cm}}{\pgfqpoint{1.366cm}{1.136cm}}{\pgfqpoint{1.34cm}{1.162cm}}
\pgfpathcurveto{\pgfqpoint{1.315cm}{1.187cm}}{\pgfqpoint{1.28cm}{1.202cm}}{\pgfqpoint{1.244cm}{1.202cm}}
\pgfpathcurveto{\pgfqpoint{1.207cm}{1.202cm}}{\pgfqpoint{1.173cm}{1.187cm}}{\pgfqpoint{1.147cm}{1.162cm}}
\pgfpathcurveto{\pgfqpoint{1.121cm}{1.136cm}}{\pgfqpoint{1.107cm}{1.101cm}}{\pgfqpoint{1.107cm}{1.065cm}}
\pgfpathcurveto{\pgfqpoint{1.107cm}{1.029cm}}{\pgfqpoint{1.121cm}{0.994cm}}{\pgfqpoint{1.147cm}{0.968cm}}
\pgfpathcurveto{\pgfqpoint{1.173cm}{0.942cm}}{\pgfqpoint{1.207cm}{0.928cm}}{\pgfqpoint{1.244cm}{0.928cm}}
\pgfpathcurveto{\pgfqpoint{1.28cm}{0.928cm}}{\pgfqpoint{1.315cm}{0.942cm}}{\pgfqpoint{1.34cm}{0.968cm}}
\pgfpathcurveto{\pgfqpoint{1.366cm}{0.994cm}}{\pgfqpoint{1.38cm}{1.029cm}}{\pgfqpoint{1.38cm}{1.065cm}}
\pgfusepath{fill}
\begin{pgfscope}
\pgfsetdash{}{0cm}
\pgfsetlinewidth{0.818mm}
\pgfsetmiterlimit{4.0}
\pgfpathmoveto{\pgfqpoint{1.383cm}{0.178cm}}
\pgfpathcurveto{\pgfqpoint{1.383cm}{0.214cm}}{\pgfqpoint{1.369cm}{0.249cm}}{\pgfqpoint{1.343cm}{0.275cm}}
\pgfpathcurveto{\pgfqpoint{1.317cm}{0.3cm}}{\pgfqpoint{1.283cm}{0.315cm}}{\pgfqpoint{1.246cm}{0.315cm}}
\pgfpathcurveto{\pgfqpoint{1.21cm}{0.315cm}}{\pgfqpoint{1.175cm}{0.3cm}}{\pgfqpoint{1.15cm}{0.275cm}}
\pgfpathcurveto{\pgfqpoint{1.124cm}{0.249cm}}{\pgfqpoint{1.11cm}{0.214cm}}{\pgfqpoint{1.11cm}{0.178cm}}
\pgfpathcurveto{\pgfqpoint{1.11cm}{0.141cm}}{\pgfqpoint{1.124cm}{0.107cm}}{\pgfqpoint{1.15cm}{0.081cm}}
\pgfpathcurveto{\pgfqpoint{1.175cm}{0.055cm}}{\pgfqpoint{1.21cm}{0.041cm}}{\pgfqpoint{1.246cm}{0.041cm}}
\pgfpathcurveto{\pgfqpoint{1.283cm}{0.041cm}}{\pgfqpoint{1.317cm}{0.055cm}}{\pgfqpoint{1.343cm}{0.081cm}}
\pgfpathcurveto{\pgfqpoint{1.369cm}{0.107cm}}{\pgfqpoint{1.383cm}{0.141cm}}{\pgfqpoint{1.383cm}{0.178cm}}
\pgfusepath{stroke}
\end{pgfscope}
\end{pgfscope}
\end{pgfscope}
\end{pgfscope}
\end{tikzpicture}}}}-\rmb{3( \phi + \psi)\succcurlyeqX^{\!\resizebox{!}{.8em}{
\begin{tikzpicture}
\pgfpathmoveto{\pgfqpoint{0cm}{-0.035cm}}
\pgfpathlineto{\pgfqpoint{1.976cm}{-0.035cm}}
\pgfpathlineto{\pgfqpoint{1.976cm}{1.94cm}}
\pgfpathlineto{\pgfqpoint{0cm}{1.94cm}}
\pgfpathclose
\pgfusepath{clip}
\begin{pgfscope}
\begin{pgfscope}
\pgfpathmoveto{\pgfqpoint{0cm}{-0.035cm}}
\pgfpathlineto{\pgfqpoint{1.976cm}{-0.035cm}}
\pgfpathlineto{\pgfqpoint{1.976cm}{1.94cm}}
\pgfpathlineto{\pgfqpoint{0cm}{1.94cm}}
\pgfpathclose
\pgfusepath{clip}
\begin{pgfscope}
\begin{pgfscope}
\pgfsetdash{}{0cm}
\pgfsetlinewidth{0.818mm}
\pgfsetroundcap
\pgfsetroundjoin
\pgfsetmiterlimit{7.0}
\definecolor{eps2pgf_color}{gray}{0}\pgfsetstrokecolor{eps2pgf_color}\pgfsetfillcolor{eps2pgf_color}
\pgfpathmoveto{\pgfqpoint{0.117cm}{1.815cm}}
\pgfpathlineto{\pgfqpoint{0.682cm}{1.065cm}}
\pgfpathlineto{\pgfqpoint{1.246cm}{1.815cm}}
\pgfusepath{stroke}
\end{pgfscope}
\definecolor{eps2pgf_color}{gray}{0}\pgfsetstrokecolor{eps2pgf_color}\pgfsetfillcolor{eps2pgf_color}
\pgfpathmoveto{\pgfqpoint{0.273cm}{1.789cm}}
\pgfpathcurveto{\pgfqpoint{0.273cm}{1.825cm}}{\pgfqpoint{0.259cm}{1.86cm}}{\pgfqpoint{0.233cm}{1.886cm}}
\pgfpathcurveto{\pgfqpoint{0.207cm}{1.912cm}}{\pgfqpoint{0.173cm}{1.926cm}}{\pgfqpoint{0.137cm}{1.926cm}}
\pgfpathcurveto{\pgfqpoint{0.1cm}{1.926cm}}{\pgfqpoint{0.066cm}{1.912cm}}{\pgfqpoint{0.04cm}{1.886cm}}
\pgfpathcurveto{\pgfqpoint{0.014cm}{1.86cm}}{\pgfqpoint{0cm}{1.825cm}}{\pgfqpoint{0cm}{1.789cm}}
\pgfpathcurveto{\pgfqpoint{0cm}{1.753cm}}{\pgfqpoint{0.014cm}{1.718cm}}{\pgfqpoint{0.04cm}{1.692cm}}
\pgfpathcurveto{\pgfqpoint{0.066cm}{1.667cm}}{\pgfqpoint{0.1cm}{1.652cm}}{\pgfqpoint{0.137cm}{1.652cm}}
\pgfpathcurveto{\pgfqpoint{0.173cm}{1.652cm}}{\pgfqpoint{0.207cm}{1.667cm}}{\pgfqpoint{0.233cm}{1.692cm}}
\pgfpathcurveto{\pgfqpoint{0.259cm}{1.718cm}}{\pgfqpoint{0.273cm}{1.753cm}}{\pgfqpoint{0.273cm}{1.789cm}}
\pgfusepath{fill}
\pgfpathmoveto{\pgfqpoint{1.345cm}{1.765cm}}
\pgfpathcurveto{\pgfqpoint{1.345cm}{1.801cm}}{\pgfqpoint{1.331cm}{1.836cm}}{\pgfqpoint{1.305cm}{1.862cm}}
\pgfpathcurveto{\pgfqpoint{1.28cm}{1.887cm}}{\pgfqpoint{1.245cm}{1.902cm}}{\pgfqpoint{1.209cm}{1.902cm}}
\pgfpathcurveto{\pgfqpoint{1.172cm}{1.902cm}}{\pgfqpoint{1.138cm}{1.887cm}}{\pgfqpoint{1.112cm}{1.862cm}}
\pgfpathcurveto{\pgfqpoint{1.087cm}{1.836cm}}{\pgfqpoint{1.072cm}{1.801cm}}{\pgfqpoint{1.072cm}{1.765cm}}
\pgfpathcurveto{\pgfqpoint{1.072cm}{1.728cm}}{\pgfqpoint{1.087cm}{1.694cm}}{\pgfqpoint{1.112cm}{1.668cm}}
\pgfpathcurveto{\pgfqpoint{1.138cm}{1.642cm}}{\pgfqpoint{1.172cm}{1.628cm}}{\pgfqpoint{1.209cm}{1.628cm}}
\pgfpathcurveto{\pgfqpoint{1.245cm}{1.628cm}}{\pgfqpoint{1.28cm}{1.642cm}}{\pgfqpoint{1.305cm}{1.668cm}}
\pgfpathcurveto{\pgfqpoint{1.331cm}{1.694cm}}{\pgfqpoint{1.345cm}{1.728cm}}{\pgfqpoint{1.345cm}{1.765cm}}
\pgfusepath{fill}
\begin{pgfscope}
\pgfsetdash{}{0cm}
\pgfsetlinewidth{0.818mm}
\pgfsetroundcap
\pgfsetroundjoin
\pgfsetmiterlimit{7.0}
\pgfpathmoveto{\pgfqpoint{0.682cm}{1.065cm}}
\pgfpathlineto{\pgfqpoint{1.246cm}{0.315cm}}
\pgfpathlineto{\pgfqpoint{1.811cm}{1.065cm}}
\pgfusepath{stroke}
\end{pgfscope}
\pgfpathmoveto{\pgfqpoint{1.948cm}{1.065cm}}
\pgfpathcurveto{\pgfqpoint{1.948cm}{1.101cm}}{\pgfqpoint{1.933cm}{1.136cm}}{\pgfqpoint{1.907cm}{1.162cm}}
\pgfpathcurveto{\pgfqpoint{1.882cm}{1.187cm}}{\pgfqpoint{1.847cm}{1.202cm}}{\pgfqpoint{1.811cm}{1.202cm}}
\pgfpathcurveto{\pgfqpoint{1.775cm}{1.202cm}}{\pgfqpoint{1.74cm}{1.187cm}}{\pgfqpoint{1.714cm}{1.162cm}}
\pgfpathcurveto{\pgfqpoint{1.689cm}{1.136cm}}{\pgfqpoint{1.674cm}{1.101cm}}{\pgfqpoint{1.674cm}{1.065cm}}
\pgfpathcurveto{\pgfqpoint{1.674cm}{1.029cm}}{\pgfqpoint{1.689cm}{0.994cm}}{\pgfqpoint{1.714cm}{0.968cm}}
\pgfpathcurveto{\pgfqpoint{1.74cm}{0.942cm}}{\pgfqpoint{1.775cm}{0.928cm}}{\pgfqpoint{1.811cm}{0.928cm}}
\pgfpathcurveto{\pgfqpoint{1.847cm}{0.928cm}}{\pgfqpoint{1.882cm}{0.942cm}}{\pgfqpoint{1.907cm}{0.968cm}}
\pgfpathcurveto{\pgfqpoint{1.933cm}{0.994cm}}{\pgfqpoint{1.948cm}{1.029cm}}{\pgfqpoint{1.948cm}{1.065cm}}
\pgfusepath{fill}
\begin{pgfscope}
\pgfsetdash{}{0cm}
\pgfsetlinewidth{0.818mm}
\pgfsetmiterlimit{7.0}
\pgfpathmoveto{\pgfqpoint{1.246cm}{0.315cm}}
\pgfpathlineto{\pgfqpoint{1.244cm}{1.061cm}}
\pgfusepath{stroke}
\end{pgfscope}
\pgfpathmoveto{\pgfqpoint{1.38cm}{1.065cm}}
\pgfpathcurveto{\pgfqpoint{1.38cm}{1.101cm}}{\pgfqpoint{1.366cm}{1.136cm}}{\pgfqpoint{1.34cm}{1.162cm}}
\pgfpathcurveto{\pgfqpoint{1.315cm}{1.187cm}}{\pgfqpoint{1.28cm}{1.202cm}}{\pgfqpoint{1.244cm}{1.202cm}}
\pgfpathcurveto{\pgfqpoint{1.207cm}{1.202cm}}{\pgfqpoint{1.173cm}{1.187cm}}{\pgfqpoint{1.147cm}{1.162cm}}
\pgfpathcurveto{\pgfqpoint{1.121cm}{1.136cm}}{\pgfqpoint{1.107cm}{1.101cm}}{\pgfqpoint{1.107cm}{1.065cm}}
\pgfpathcurveto{\pgfqpoint{1.107cm}{1.029cm}}{\pgfqpoint{1.121cm}{0.994cm}}{\pgfqpoint{1.147cm}{0.968cm}}
\pgfpathcurveto{\pgfqpoint{1.173cm}{0.942cm}}{\pgfqpoint{1.207cm}{0.928cm}}{\pgfqpoint{1.244cm}{0.928cm}}
\pgfpathcurveto{\pgfqpoint{1.28cm}{0.928cm}}{\pgfqpoint{1.315cm}{0.942cm}}{\pgfqpoint{1.34cm}{0.968cm}}
\pgfpathcurveto{\pgfqpoint{1.366cm}{0.994cm}}{\pgfqpoint{1.38cm}{1.029cm}}{\pgfqpoint{1.38cm}{1.065cm}}
\pgfusepath{fill}
\begin{pgfscope}
\pgfsetdash{}{0cm}
\pgfsetlinewidth{0.818mm}
\pgfsetmiterlimit{4.0}
\pgfpathmoveto{\pgfqpoint{1.383cm}{0.178cm}}
\pgfpathcurveto{\pgfqpoint{1.383cm}{0.214cm}}{\pgfqpoint{1.369cm}{0.249cm}}{\pgfqpoint{1.343cm}{0.275cm}}
\pgfpathcurveto{\pgfqpoint{1.317cm}{0.3cm}}{\pgfqpoint{1.283cm}{0.315cm}}{\pgfqpoint{1.246cm}{0.315cm}}
\pgfpathcurveto{\pgfqpoint{1.21cm}{0.315cm}}{\pgfqpoint{1.175cm}{0.3cm}}{\pgfqpoint{1.15cm}{0.275cm}}
\pgfpathcurveto{\pgfqpoint{1.124cm}{0.249cm}}{\pgfqpoint{1.11cm}{0.214cm}}{\pgfqpoint{1.11cm}{0.178cm}}
\pgfpathcurveto{\pgfqpoint{1.11cm}{0.141cm}}{\pgfqpoint{1.124cm}{0.107cm}}{\pgfqpoint{1.15cm}{0.081cm}}
\pgfpathcurveto{\pgfqpoint{1.175cm}{0.055cm}}{\pgfqpoint{1.21cm}{0.041cm}}{\pgfqpoint{1.246cm}{0.041cm}}
\pgfpathcurveto{\pgfqpoint{1.283cm}{0.041cm}}{\pgfqpoint{1.317cm}{0.055cm}}{\pgfqpoint{1.343cm}{0.081cm}}
\pgfpathcurveto{\pgfqpoint{1.369cm}{0.107cm}}{\pgfqpoint{1.383cm}{0.141cm}}{\pgfqpoint{1.383cm}{0.178cm}}
\pgfusepath{stroke}
\end{pgfscope}
\end{pgfscope}
\end{pgfscope}
\end{pgfscope}
\end{tikzpicture}}}}\\
&\qquad +\rmb{6\UU_\leqslant X\prec(X^{\!\resizebox{0.6em}{!}{
\begin{tikzpicture}
\pgfpathmoveto{\pgfqpoint{0cm}{-0.035cm}}
\pgfpathlineto{\pgfqpoint{1.376cm}{-0.035cm}}
\pgfpathlineto{\pgfqpoint{1.376cm}{1.552cm}}
\pgfpathlineto{\pgfqpoint{0cm}{1.552cm}}
\pgfpathclose
\pgfusepath{clip}
\begin{pgfscope}
\begin{pgfscope}
\pgfpathmoveto{\pgfqpoint{0cm}{-0.035cm}}
\pgfpathlineto{\pgfqpoint{1.376cm}{-0.035cm}}
\pgfpathlineto{\pgfqpoint{1.376cm}{1.552cm}}
\pgfpathlineto{\pgfqpoint{0cm}{1.552cm}}
\pgfpathclose
\pgfusepath{clip}
\begin{pgfscope}
\begin{pgfscope}
\pgfsetdash{}{0cm}
\pgfsetlinewidth{0.818mm}
\pgfsetroundcap
\pgfsetroundjoin
\pgfsetmiterlimit{7.0}
\definecolor{eps2pgf_color}{gray}{0}\pgfsetstrokecolor{eps2pgf_color}\pgfsetfillcolor{eps2pgf_color}
\pgfpathmoveto{\pgfqpoint{0.117cm}{1.421cm}}
\pgfpathlineto{\pgfqpoint{0.682cm}{0.671cm}}
\pgfpathlineto{\pgfqpoint{1.246cm}{1.421cm}}
\pgfusepath{stroke}
\end{pgfscope}
\definecolor{eps2pgf_color}{gray}{0}\pgfsetstrokecolor{eps2pgf_color}\pgfsetfillcolor{eps2pgf_color}
\pgfpathmoveto{\pgfqpoint{0.273cm}{1.395cm}}
\pgfpathcurveto{\pgfqpoint{0.273cm}{1.432cm}}{\pgfqpoint{0.259cm}{1.467cm}}{\pgfqpoint{0.233cm}{1.492cm}}
\pgfpathcurveto{\pgfqpoint{0.207cm}{1.518cm}}{\pgfqpoint{0.173cm}{1.532cm}}{\pgfqpoint{0.137cm}{1.532cm}}
\pgfpathcurveto{\pgfqpoint{0.1cm}{1.532cm}}{\pgfqpoint{0.066cm}{1.518cm}}{\pgfqpoint{0.04cm}{1.492cm}}
\pgfpathcurveto{\pgfqpoint{0.014cm}{1.467cm}}{\pgfqpoint{0cm}{1.432cm}}{\pgfqpoint{0cm}{1.395cm}}
\pgfpathcurveto{\pgfqpoint{0cm}{1.359cm}}{\pgfqpoint{0.014cm}{1.324cm}}{\pgfqpoint{0.04cm}{1.299cm}}
\pgfpathcurveto{\pgfqpoint{0.066cm}{1.273cm}}{\pgfqpoint{0.1cm}{1.258cm}}{\pgfqpoint{0.137cm}{1.258cm}}
\pgfpathcurveto{\pgfqpoint{0.173cm}{1.258cm}}{\pgfqpoint{0.207cm}{1.273cm}}{\pgfqpoint{0.233cm}{1.299cm}}
\pgfpathcurveto{\pgfqpoint{0.259cm}{1.324cm}}{\pgfqpoint{0.273cm}{1.359cm}}{\pgfqpoint{0.273cm}{1.395cm}}
\pgfusepath{fill}
\begin{pgfscope}
\pgfsetdash{}{0cm}
\pgfsetlinewidth{0.818mm}
\pgfsetmiterlimit{7.0}
\pgfpathmoveto{\pgfqpoint{0.682cm}{0.671cm}}
\pgfpathlineto{\pgfqpoint{0.679cm}{1.418cm}}
\pgfusepath{stroke}
\end{pgfscope}
\pgfpathmoveto{\pgfqpoint{0.815cm}{1.399cm}}
\pgfpathcurveto{\pgfqpoint{0.815cm}{1.435cm}}{\pgfqpoint{0.801cm}{1.47cm}}{\pgfqpoint{0.775cm}{1.496cm}}
\pgfpathcurveto{\pgfqpoint{0.75cm}{1.521cm}}{\pgfqpoint{0.715cm}{1.536cm}}{\pgfqpoint{0.679cm}{1.536cm}}
\pgfpathcurveto{\pgfqpoint{0.643cm}{1.536cm}}{\pgfqpoint{0.608cm}{1.521cm}}{\pgfqpoint{0.582cm}{1.496cm}}
\pgfpathcurveto{\pgfqpoint{0.557cm}{1.47cm}}{\pgfqpoint{0.542cm}{1.435cm}}{\pgfqpoint{0.542cm}{1.399cm}}
\pgfpathcurveto{\pgfqpoint{0.542cm}{1.363cm}}{\pgfqpoint{0.557cm}{1.328cm}}{\pgfqpoint{0.582cm}{1.302cm}}
\pgfpathcurveto{\pgfqpoint{0.608cm}{1.276cm}}{\pgfqpoint{0.643cm}{1.262cm}}{\pgfqpoint{0.679cm}{1.262cm}}
\pgfpathcurveto{\pgfqpoint{0.715cm}{1.262cm}}{\pgfqpoint{0.75cm}{1.276cm}}{\pgfqpoint{0.775cm}{1.302cm}}
\pgfpathcurveto{\pgfqpoint{0.801cm}{1.328cm}}{\pgfqpoint{0.815cm}{1.363cm}}{\pgfqpoint{0.815cm}{1.399cm}}
\pgfusepath{fill}
\pgfpathmoveto{\pgfqpoint{1.345cm}{1.371cm}}
\pgfpathcurveto{\pgfqpoint{1.345cm}{1.408cm}}{\pgfqpoint{1.331cm}{1.442cm}}{\pgfqpoint{1.305cm}{1.468cm}}
\pgfpathcurveto{\pgfqpoint{1.28cm}{1.494cm}}{\pgfqpoint{1.245cm}{1.508cm}}{\pgfqpoint{1.209cm}{1.508cm}}
\pgfpathcurveto{\pgfqpoint{1.172cm}{1.508cm}}{\pgfqpoint{1.138cm}{1.494cm}}{\pgfqpoint{1.112cm}{1.468cm}}
\pgfpathcurveto{\pgfqpoint{1.087cm}{1.442cm}}{\pgfqpoint{1.072cm}{1.408cm}}{\pgfqpoint{1.072cm}{1.371cm}}
\pgfpathcurveto{\pgfqpoint{1.072cm}{1.335cm}}{\pgfqpoint{1.087cm}{1.3cm}}{\pgfqpoint{1.112cm}{1.274cm}}
\pgfpathcurveto{\pgfqpoint{1.138cm}{1.249cm}}{\pgfqpoint{1.172cm}{1.234cm}}{\pgfqpoint{1.209cm}{1.234cm}}
\pgfpathcurveto{\pgfqpoint{1.245cm}{1.234cm}}{\pgfqpoint{1.28cm}{1.249cm}}{\pgfqpoint{1.305cm}{1.274cm}}
\pgfpathcurveto{\pgfqpoint{1.331cm}{1.3cm}}{\pgfqpoint{1.345cm}{1.335cm}}{\pgfqpoint{1.345cm}{1.371cm}}
\pgfusepath{fill}
\begin{pgfscope}
\pgfsetdash{}{0cm}
\pgfsetlinewidth{0.818mm}
\pgfsetroundcap
\pgfsetmiterlimit{4.0}
\pgfpathmoveto{\pgfqpoint{0.682cm}{0.671cm}}
\pgfpathlineto{\pgfqpoint{0.682cm}{0.042cm}}
\pgfusepath{stroke}
\end{pgfscope}
\end{pgfscope}
\end{pgfscope}
\end{pgfscope}
\end{tikzpicture}}}(\phi+\psi))}+\rmb{6\UU_\leqslant X\prec(X^{\!\resizebox{0.6em}{!}{
\begin{tikzpicture}
\pgfpathmoveto{\pgfqpoint{0cm}{-0.035cm}}
\pgfpathlineto{\pgfqpoint{1.376cm}{-0.035cm}}
\pgfpathlineto{\pgfqpoint{1.376cm}{1.552cm}}
\pgfpathlineto{\pgfqpoint{0cm}{1.552cm}}
\pgfpathclose
\pgfusepath{clip}
\begin{pgfscope}
\begin{pgfscope}
\pgfpathmoveto{\pgfqpoint{0cm}{-0.035cm}}
\pgfpathlineto{\pgfqpoint{1.376cm}{-0.035cm}}
\pgfpathlineto{\pgfqpoint{1.376cm}{1.552cm}}
\pgfpathlineto{\pgfqpoint{0cm}{1.552cm}}
\pgfpathclose
\pgfusepath{clip}
\begin{pgfscope}
\begin{pgfscope}
\pgfsetdash{}{0cm}
\pgfsetlinewidth{0.818mm}
\pgfsetroundcap
\pgfsetroundjoin
\pgfsetmiterlimit{7.0}
\definecolor{eps2pgf_color}{gray}{0}\pgfsetstrokecolor{eps2pgf_color}\pgfsetfillcolor{eps2pgf_color}
\pgfpathmoveto{\pgfqpoint{0.117cm}{1.421cm}}
\pgfpathlineto{\pgfqpoint{0.682cm}{0.671cm}}
\pgfpathlineto{\pgfqpoint{1.246cm}{1.421cm}}
\pgfusepath{stroke}
\end{pgfscope}
\definecolor{eps2pgf_color}{gray}{0}\pgfsetstrokecolor{eps2pgf_color}\pgfsetfillcolor{eps2pgf_color}
\pgfpathmoveto{\pgfqpoint{0.273cm}{1.395cm}}
\pgfpathcurveto{\pgfqpoint{0.273cm}{1.432cm}}{\pgfqpoint{0.259cm}{1.467cm}}{\pgfqpoint{0.233cm}{1.492cm}}
\pgfpathcurveto{\pgfqpoint{0.207cm}{1.518cm}}{\pgfqpoint{0.173cm}{1.532cm}}{\pgfqpoint{0.137cm}{1.532cm}}
\pgfpathcurveto{\pgfqpoint{0.1cm}{1.532cm}}{\pgfqpoint{0.066cm}{1.518cm}}{\pgfqpoint{0.04cm}{1.492cm}}
\pgfpathcurveto{\pgfqpoint{0.014cm}{1.467cm}}{\pgfqpoint{0cm}{1.432cm}}{\pgfqpoint{0cm}{1.395cm}}
\pgfpathcurveto{\pgfqpoint{0cm}{1.359cm}}{\pgfqpoint{0.014cm}{1.324cm}}{\pgfqpoint{0.04cm}{1.299cm}}
\pgfpathcurveto{\pgfqpoint{0.066cm}{1.273cm}}{\pgfqpoint{0.1cm}{1.258cm}}{\pgfqpoint{0.137cm}{1.258cm}}
\pgfpathcurveto{\pgfqpoint{0.173cm}{1.258cm}}{\pgfqpoint{0.207cm}{1.273cm}}{\pgfqpoint{0.233cm}{1.299cm}}
\pgfpathcurveto{\pgfqpoint{0.259cm}{1.324cm}}{\pgfqpoint{0.273cm}{1.359cm}}{\pgfqpoint{0.273cm}{1.395cm}}
\pgfusepath{fill}
\begin{pgfscope}
\pgfsetdash{}{0cm}
\pgfsetlinewidth{0.818mm}
\pgfsetmiterlimit{7.0}
\pgfpathmoveto{\pgfqpoint{0.682cm}{0.671cm}}
\pgfpathlineto{\pgfqpoint{0.679cm}{1.418cm}}
\pgfusepath{stroke}
\end{pgfscope}
\pgfpathmoveto{\pgfqpoint{0.815cm}{1.399cm}}
\pgfpathcurveto{\pgfqpoint{0.815cm}{1.435cm}}{\pgfqpoint{0.801cm}{1.47cm}}{\pgfqpoint{0.775cm}{1.496cm}}
\pgfpathcurveto{\pgfqpoint{0.75cm}{1.521cm}}{\pgfqpoint{0.715cm}{1.536cm}}{\pgfqpoint{0.679cm}{1.536cm}}
\pgfpathcurveto{\pgfqpoint{0.643cm}{1.536cm}}{\pgfqpoint{0.608cm}{1.521cm}}{\pgfqpoint{0.582cm}{1.496cm}}
\pgfpathcurveto{\pgfqpoint{0.557cm}{1.47cm}}{\pgfqpoint{0.542cm}{1.435cm}}{\pgfqpoint{0.542cm}{1.399cm}}
\pgfpathcurveto{\pgfqpoint{0.542cm}{1.363cm}}{\pgfqpoint{0.557cm}{1.328cm}}{\pgfqpoint{0.582cm}{1.302cm}}
\pgfpathcurveto{\pgfqpoint{0.608cm}{1.276cm}}{\pgfqpoint{0.643cm}{1.262cm}}{\pgfqpoint{0.679cm}{1.262cm}}
\pgfpathcurveto{\pgfqpoint{0.715cm}{1.262cm}}{\pgfqpoint{0.75cm}{1.276cm}}{\pgfqpoint{0.775cm}{1.302cm}}
\pgfpathcurveto{\pgfqpoint{0.801cm}{1.328cm}}{\pgfqpoint{0.815cm}{1.363cm}}{\pgfqpoint{0.815cm}{1.399cm}}
\pgfusepath{fill}
\pgfpathmoveto{\pgfqpoint{1.345cm}{1.371cm}}
\pgfpathcurveto{\pgfqpoint{1.345cm}{1.408cm}}{\pgfqpoint{1.331cm}{1.442cm}}{\pgfqpoint{1.305cm}{1.468cm}}
\pgfpathcurveto{\pgfqpoint{1.28cm}{1.494cm}}{\pgfqpoint{1.245cm}{1.508cm}}{\pgfqpoint{1.209cm}{1.508cm}}
\pgfpathcurveto{\pgfqpoint{1.172cm}{1.508cm}}{\pgfqpoint{1.138cm}{1.494cm}}{\pgfqpoint{1.112cm}{1.468cm}}
\pgfpathcurveto{\pgfqpoint{1.087cm}{1.442cm}}{\pgfqpoint{1.072cm}{1.408cm}}{\pgfqpoint{1.072cm}{1.371cm}}
\pgfpathcurveto{\pgfqpoint{1.072cm}{1.335cm}}{\pgfqpoint{1.087cm}{1.3cm}}{\pgfqpoint{1.112cm}{1.274cm}}
\pgfpathcurveto{\pgfqpoint{1.138cm}{1.249cm}}{\pgfqpoint{1.172cm}{1.234cm}}{\pgfqpoint{1.209cm}{1.234cm}}
\pgfpathcurveto{\pgfqpoint{1.245cm}{1.234cm}}{\pgfqpoint{1.28cm}{1.249cm}}{\pgfqpoint{1.305cm}{1.274cm}}
\pgfpathcurveto{\pgfqpoint{1.331cm}{1.3cm}}{\pgfqpoint{1.345cm}{1.335cm}}{\pgfqpoint{1.345cm}{1.371cm}}
\pgfusepath{fill}
\begin{pgfscope}
\pgfsetdash{}{0cm}
\pgfsetlinewidth{0.818mm}
\pgfsetroundcap
\pgfsetmiterlimit{4.0}
\pgfpathmoveto{\pgfqpoint{0.682cm}{0.671cm}}
\pgfpathlineto{\pgfqpoint{0.682cm}{0.042cm}}
\pgfusepath{stroke}
\end{pgfscope}
\end{pgfscope}
\end{pgfscope}
\end{pgfscope}
\end{tikzpicture}}}(\phi+\psi))}\\
&\qquad+\rmb{6(\phi+\psi)\prec\UU_{\leqslant}X^{\!\resizebox{!}{.8em}{
\begin{tikzpicture}
\pgfpathmoveto{\pgfqpoint{0cm}{-0.035cm}}
\pgfpathlineto{\pgfqpoint{1.976cm}{-0.035cm}}
\pgfpathlineto{\pgfqpoint{1.976cm}{1.94cm}}
\pgfpathlineto{\pgfqpoint{0cm}{1.94cm}}
\pgfpathclose
\pgfusepath{clip}
\begin{pgfscope}
\begin{pgfscope}
\pgfpathmoveto{\pgfqpoint{0cm}{-0.035cm}}
\pgfpathlineto{\pgfqpoint{1.976cm}{-0.035cm}}
\pgfpathlineto{\pgfqpoint{1.976cm}{1.94cm}}
\pgfpathlineto{\pgfqpoint{0cm}{1.94cm}}
\pgfpathclose
\pgfusepath{clip}
\begin{pgfscope}
\begin{pgfscope}
\pgfsetdash{}{0cm}
\pgfsetlinewidth{0.818mm}
\pgfsetroundcap
\pgfsetroundjoin
\pgfsetmiterlimit{7.0}
\definecolor{eps2pgf_color}{gray}{0}\pgfsetstrokecolor{eps2pgf_color}\pgfsetfillcolor{eps2pgf_color}
\pgfpathmoveto{\pgfqpoint{0.117cm}{1.815cm}}
\pgfpathlineto{\pgfqpoint{0.682cm}{1.065cm}}
\pgfpathlineto{\pgfqpoint{1.246cm}{1.815cm}}
\pgfusepath{stroke}
\end{pgfscope}
\definecolor{eps2pgf_color}{gray}{0}\pgfsetstrokecolor{eps2pgf_color}\pgfsetfillcolor{eps2pgf_color}
\pgfpathmoveto{\pgfqpoint{0.273cm}{1.789cm}}
\pgfpathcurveto{\pgfqpoint{0.273cm}{1.825cm}}{\pgfqpoint{0.259cm}{1.86cm}}{\pgfqpoint{0.233cm}{1.886cm}}
\pgfpathcurveto{\pgfqpoint{0.207cm}{1.912cm}}{\pgfqpoint{0.173cm}{1.926cm}}{\pgfqpoint{0.137cm}{1.926cm}}
\pgfpathcurveto{\pgfqpoint{0.1cm}{1.926cm}}{\pgfqpoint{0.066cm}{1.912cm}}{\pgfqpoint{0.04cm}{1.886cm}}
\pgfpathcurveto{\pgfqpoint{0.014cm}{1.86cm}}{\pgfqpoint{0cm}{1.825cm}}{\pgfqpoint{0cm}{1.789cm}}
\pgfpathcurveto{\pgfqpoint{0cm}{1.753cm}}{\pgfqpoint{0.014cm}{1.718cm}}{\pgfqpoint{0.04cm}{1.692cm}}
\pgfpathcurveto{\pgfqpoint{0.066cm}{1.667cm}}{\pgfqpoint{0.1cm}{1.652cm}}{\pgfqpoint{0.137cm}{1.652cm}}
\pgfpathcurveto{\pgfqpoint{0.173cm}{1.652cm}}{\pgfqpoint{0.207cm}{1.667cm}}{\pgfqpoint{0.233cm}{1.692cm}}
\pgfpathcurveto{\pgfqpoint{0.259cm}{1.718cm}}{\pgfqpoint{0.273cm}{1.753cm}}{\pgfqpoint{0.273cm}{1.789cm}}
\pgfusepath{fill}
\begin{pgfscope}
\pgfsetdash{}{0cm}
\pgfsetlinewidth{0.818mm}
\pgfsetmiterlimit{7.0}
\pgfpathmoveto{\pgfqpoint{0.682cm}{1.065cm}}
\pgfpathlineto{\pgfqpoint{0.679cm}{1.812cm}}
\pgfusepath{stroke}
\end{pgfscope}
\pgfpathmoveto{\pgfqpoint{0.815cm}{1.793cm}}
\pgfpathcurveto{\pgfqpoint{0.815cm}{1.829cm}}{\pgfqpoint{0.801cm}{1.864cm}}{\pgfqpoint{0.775cm}{1.89cm}}
\pgfpathcurveto{\pgfqpoint{0.75cm}{1.915cm}}{\pgfqpoint{0.715cm}{1.93cm}}{\pgfqpoint{0.679cm}{1.93cm}}
\pgfpathcurveto{\pgfqpoint{0.643cm}{1.93cm}}{\pgfqpoint{0.608cm}{1.915cm}}{\pgfqpoint{0.582cm}{1.89cm}}
\pgfpathcurveto{\pgfqpoint{0.557cm}{1.864cm}}{\pgfqpoint{0.542cm}{1.829cm}}{\pgfqpoint{0.542cm}{1.793cm}}
\pgfpathcurveto{\pgfqpoint{0.542cm}{1.756cm}}{\pgfqpoint{0.557cm}{1.722cm}}{\pgfqpoint{0.582cm}{1.696cm}}
\pgfpathcurveto{\pgfqpoint{0.608cm}{1.67cm}}{\pgfqpoint{0.643cm}{1.656cm}}{\pgfqpoint{0.679cm}{1.656cm}}
\pgfpathcurveto{\pgfqpoint{0.715cm}{1.656cm}}{\pgfqpoint{0.75cm}{1.67cm}}{\pgfqpoint{0.775cm}{1.696cm}}
\pgfpathcurveto{\pgfqpoint{0.801cm}{1.722cm}}{\pgfqpoint{0.815cm}{1.756cm}}{\pgfqpoint{0.815cm}{1.793cm}}
\pgfusepath{fill}
\pgfpathmoveto{\pgfqpoint{1.345cm}{1.765cm}}
\pgfpathcurveto{\pgfqpoint{1.345cm}{1.801cm}}{\pgfqpoint{1.331cm}{1.836cm}}{\pgfqpoint{1.305cm}{1.862cm}}
\pgfpathcurveto{\pgfqpoint{1.28cm}{1.887cm}}{\pgfqpoint{1.245cm}{1.902cm}}{\pgfqpoint{1.209cm}{1.902cm}}
\pgfpathcurveto{\pgfqpoint{1.172cm}{1.902cm}}{\pgfqpoint{1.138cm}{1.887cm}}{\pgfqpoint{1.112cm}{1.862cm}}
\pgfpathcurveto{\pgfqpoint{1.087cm}{1.836cm}}{\pgfqpoint{1.072cm}{1.801cm}}{\pgfqpoint{1.072cm}{1.765cm}}
\pgfpathcurveto{\pgfqpoint{1.072cm}{1.728cm}}{\pgfqpoint{1.087cm}{1.694cm}}{\pgfqpoint{1.112cm}{1.668cm}}
\pgfpathcurveto{\pgfqpoint{1.138cm}{1.642cm}}{\pgfqpoint{1.172cm}{1.628cm}}{\pgfqpoint{1.209cm}{1.628cm}}
\pgfpathcurveto{\pgfqpoint{1.245cm}{1.628cm}}{\pgfqpoint{1.28cm}{1.642cm}}{\pgfqpoint{1.305cm}{1.668cm}}
\pgfpathcurveto{\pgfqpoint{1.331cm}{1.694cm}}{\pgfqpoint{1.345cm}{1.728cm}}{\pgfqpoint{1.345cm}{1.765cm}}
\pgfusepath{fill}
\begin{pgfscope}
\pgfsetdash{}{0cm}
\pgfsetlinewidth{0.818mm}
\pgfsetroundcap
\pgfsetroundjoin
\pgfsetmiterlimit{7.0}
\pgfpathmoveto{\pgfqpoint{0.682cm}{1.065cm}}
\pgfpathlineto{\pgfqpoint{1.246cm}{0.315cm}}
\pgfpathlineto{\pgfqpoint{1.811cm}{1.065cm}}
\pgfusepath{stroke}
\end{pgfscope}
\pgfpathmoveto{\pgfqpoint{1.948cm}{1.065cm}}
\pgfpathcurveto{\pgfqpoint{1.948cm}{1.101cm}}{\pgfqpoint{1.933cm}{1.136cm}}{\pgfqpoint{1.907cm}{1.162cm}}
\pgfpathcurveto{\pgfqpoint{1.882cm}{1.187cm}}{\pgfqpoint{1.847cm}{1.202cm}}{\pgfqpoint{1.811cm}{1.202cm}}
\pgfpathcurveto{\pgfqpoint{1.775cm}{1.202cm}}{\pgfqpoint{1.74cm}{1.187cm}}{\pgfqpoint{1.714cm}{1.162cm}}
\pgfpathcurveto{\pgfqpoint{1.689cm}{1.136cm}}{\pgfqpoint{1.674cm}{1.101cm}}{\pgfqpoint{1.674cm}{1.065cm}}
\pgfpathcurveto{\pgfqpoint{1.674cm}{1.029cm}}{\pgfqpoint{1.689cm}{0.994cm}}{\pgfqpoint{1.714cm}{0.968cm}}
\pgfpathcurveto{\pgfqpoint{1.74cm}{0.942cm}}{\pgfqpoint{1.775cm}{0.928cm}}{\pgfqpoint{1.811cm}{0.928cm}}
\pgfpathcurveto{\pgfqpoint{1.847cm}{0.928cm}}{\pgfqpoint{1.882cm}{0.942cm}}{\pgfqpoint{1.907cm}{0.968cm}}
\pgfpathcurveto{\pgfqpoint{1.933cm}{0.994cm}}{\pgfqpoint{1.948cm}{1.029cm}}{\pgfqpoint{1.948cm}{1.065cm}}
\pgfusepath{fill}
\begin{pgfscope}
\pgfsetdash{}{0cm}
\pgfsetlinewidth{0.818mm}
\pgfsetmiterlimit{4.0}
\pgfpathmoveto{\pgfqpoint{1.383cm}{0.178cm}}
\pgfpathcurveto{\pgfqpoint{1.383cm}{0.214cm}}{\pgfqpoint{1.369cm}{0.249cm}}{\pgfqpoint{1.343cm}{0.275cm}}
\pgfpathcurveto{\pgfqpoint{1.317cm}{0.3cm}}{\pgfqpoint{1.283cm}{0.315cm}}{\pgfqpoint{1.246cm}{0.315cm}}
\pgfpathcurveto{\pgfqpoint{1.21cm}{0.315cm}}{\pgfqpoint{1.175cm}{0.3cm}}{\pgfqpoint{1.15cm}{0.275cm}}
\pgfpathcurveto{\pgfqpoint{1.124cm}{0.249cm}}{\pgfqpoint{1.11cm}{0.214cm}}{\pgfqpoint{1.11cm}{0.178cm}}
\pgfpathcurveto{\pgfqpoint{1.11cm}{0.141cm}}{\pgfqpoint{1.124cm}{0.107cm}}{\pgfqpoint{1.15cm}{0.081cm}}
\pgfpathcurveto{\pgfqpoint{1.175cm}{0.055cm}}{\pgfqpoint{1.21cm}{0.041cm}}{\pgfqpoint{1.246cm}{0.041cm}}
\pgfpathcurveto{\pgfqpoint{1.283cm}{0.041cm}}{\pgfqpoint{1.317cm}{0.055cm}}{\pgfqpoint{1.343cm}{0.081cm}}
\pgfpathcurveto{\pgfqpoint{1.369cm}{0.107cm}}{\pgfqpoint{1.383cm}{0.141cm}}{\pgfqpoint{1.383cm}{0.178cm}}
\pgfusepath{stroke}
\end{pgfscope}
\end{pgfscope}
\end{pgfscope}
\end{pgfscope}
\end{tikzpicture}}}}+\rmb{6(\phi+\psi)\succcurlyeqX^{\!\resizebox{!}{.8em}{
\begin{tikzpicture}
\pgfpathmoveto{\pgfqpoint{0cm}{-0.035cm}}
\pgfpathlineto{\pgfqpoint{1.976cm}{-0.035cm}}
\pgfpathlineto{\pgfqpoint{1.976cm}{1.94cm}}
\pgfpathlineto{\pgfqpoint{0cm}{1.94cm}}
\pgfpathclose
\pgfusepath{clip}
\begin{pgfscope}
\begin{pgfscope}
\pgfpathmoveto{\pgfqpoint{0cm}{-0.035cm}}
\pgfpathlineto{\pgfqpoint{1.976cm}{-0.035cm}}
\pgfpathlineto{\pgfqpoint{1.976cm}{1.94cm}}
\pgfpathlineto{\pgfqpoint{0cm}{1.94cm}}
\pgfpathclose
\pgfusepath{clip}
\begin{pgfscope}
\begin{pgfscope}
\pgfsetdash{}{0cm}
\pgfsetlinewidth{0.818mm}
\pgfsetroundcap
\pgfsetroundjoin
\pgfsetmiterlimit{7.0}
\definecolor{eps2pgf_color}{gray}{0}\pgfsetstrokecolor{eps2pgf_color}\pgfsetfillcolor{eps2pgf_color}
\pgfpathmoveto{\pgfqpoint{0.117cm}{1.815cm}}
\pgfpathlineto{\pgfqpoint{0.682cm}{1.065cm}}
\pgfpathlineto{\pgfqpoint{1.246cm}{1.815cm}}
\pgfusepath{stroke}
\end{pgfscope}
\definecolor{eps2pgf_color}{gray}{0}\pgfsetstrokecolor{eps2pgf_color}\pgfsetfillcolor{eps2pgf_color}
\pgfpathmoveto{\pgfqpoint{0.273cm}{1.789cm}}
\pgfpathcurveto{\pgfqpoint{0.273cm}{1.825cm}}{\pgfqpoint{0.259cm}{1.86cm}}{\pgfqpoint{0.233cm}{1.886cm}}
\pgfpathcurveto{\pgfqpoint{0.207cm}{1.912cm}}{\pgfqpoint{0.173cm}{1.926cm}}{\pgfqpoint{0.137cm}{1.926cm}}
\pgfpathcurveto{\pgfqpoint{0.1cm}{1.926cm}}{\pgfqpoint{0.066cm}{1.912cm}}{\pgfqpoint{0.04cm}{1.886cm}}
\pgfpathcurveto{\pgfqpoint{0.014cm}{1.86cm}}{\pgfqpoint{0cm}{1.825cm}}{\pgfqpoint{0cm}{1.789cm}}
\pgfpathcurveto{\pgfqpoint{0cm}{1.753cm}}{\pgfqpoint{0.014cm}{1.718cm}}{\pgfqpoint{0.04cm}{1.692cm}}
\pgfpathcurveto{\pgfqpoint{0.066cm}{1.667cm}}{\pgfqpoint{0.1cm}{1.652cm}}{\pgfqpoint{0.137cm}{1.652cm}}
\pgfpathcurveto{\pgfqpoint{0.173cm}{1.652cm}}{\pgfqpoint{0.207cm}{1.667cm}}{\pgfqpoint{0.233cm}{1.692cm}}
\pgfpathcurveto{\pgfqpoint{0.259cm}{1.718cm}}{\pgfqpoint{0.273cm}{1.753cm}}{\pgfqpoint{0.273cm}{1.789cm}}
\pgfusepath{fill}
\begin{pgfscope}
\pgfsetdash{}{0cm}
\pgfsetlinewidth{0.818mm}
\pgfsetmiterlimit{7.0}
\pgfpathmoveto{\pgfqpoint{0.682cm}{1.065cm}}
\pgfpathlineto{\pgfqpoint{0.679cm}{1.812cm}}
\pgfusepath{stroke}
\end{pgfscope}
\pgfpathmoveto{\pgfqpoint{0.815cm}{1.793cm}}
\pgfpathcurveto{\pgfqpoint{0.815cm}{1.829cm}}{\pgfqpoint{0.801cm}{1.864cm}}{\pgfqpoint{0.775cm}{1.89cm}}
\pgfpathcurveto{\pgfqpoint{0.75cm}{1.915cm}}{\pgfqpoint{0.715cm}{1.93cm}}{\pgfqpoint{0.679cm}{1.93cm}}
\pgfpathcurveto{\pgfqpoint{0.643cm}{1.93cm}}{\pgfqpoint{0.608cm}{1.915cm}}{\pgfqpoint{0.582cm}{1.89cm}}
\pgfpathcurveto{\pgfqpoint{0.557cm}{1.864cm}}{\pgfqpoint{0.542cm}{1.829cm}}{\pgfqpoint{0.542cm}{1.793cm}}
\pgfpathcurveto{\pgfqpoint{0.542cm}{1.756cm}}{\pgfqpoint{0.557cm}{1.722cm}}{\pgfqpoint{0.582cm}{1.696cm}}
\pgfpathcurveto{\pgfqpoint{0.608cm}{1.67cm}}{\pgfqpoint{0.643cm}{1.656cm}}{\pgfqpoint{0.679cm}{1.656cm}}
\pgfpathcurveto{\pgfqpoint{0.715cm}{1.656cm}}{\pgfqpoint{0.75cm}{1.67cm}}{\pgfqpoint{0.775cm}{1.696cm}}
\pgfpathcurveto{\pgfqpoint{0.801cm}{1.722cm}}{\pgfqpoint{0.815cm}{1.756cm}}{\pgfqpoint{0.815cm}{1.793cm}}
\pgfusepath{fill}
\pgfpathmoveto{\pgfqpoint{1.345cm}{1.765cm}}
\pgfpathcurveto{\pgfqpoint{1.345cm}{1.801cm}}{\pgfqpoint{1.331cm}{1.836cm}}{\pgfqpoint{1.305cm}{1.862cm}}
\pgfpathcurveto{\pgfqpoint{1.28cm}{1.887cm}}{\pgfqpoint{1.245cm}{1.902cm}}{\pgfqpoint{1.209cm}{1.902cm}}
\pgfpathcurveto{\pgfqpoint{1.172cm}{1.902cm}}{\pgfqpoint{1.138cm}{1.887cm}}{\pgfqpoint{1.112cm}{1.862cm}}
\pgfpathcurveto{\pgfqpoint{1.087cm}{1.836cm}}{\pgfqpoint{1.072cm}{1.801cm}}{\pgfqpoint{1.072cm}{1.765cm}}
\pgfpathcurveto{\pgfqpoint{1.072cm}{1.728cm}}{\pgfqpoint{1.087cm}{1.694cm}}{\pgfqpoint{1.112cm}{1.668cm}}
\pgfpathcurveto{\pgfqpoint{1.138cm}{1.642cm}}{\pgfqpoint{1.172cm}{1.628cm}}{\pgfqpoint{1.209cm}{1.628cm}}
\pgfpathcurveto{\pgfqpoint{1.245cm}{1.628cm}}{\pgfqpoint{1.28cm}{1.642cm}}{\pgfqpoint{1.305cm}{1.668cm}}
\pgfpathcurveto{\pgfqpoint{1.331cm}{1.694cm}}{\pgfqpoint{1.345cm}{1.728cm}}{\pgfqpoint{1.345cm}{1.765cm}}
\pgfusepath{fill}
\begin{pgfscope}
\pgfsetdash{}{0cm}
\pgfsetlinewidth{0.818mm}
\pgfsetroundcap
\pgfsetroundjoin
\pgfsetmiterlimit{7.0}
\pgfpathmoveto{\pgfqpoint{0.682cm}{1.065cm}}
\pgfpathlineto{\pgfqpoint{1.246cm}{0.315cm}}
\pgfpathlineto{\pgfqpoint{1.811cm}{1.065cm}}
\pgfusepath{stroke}
\end{pgfscope}
\pgfpathmoveto{\pgfqpoint{1.948cm}{1.065cm}}
\pgfpathcurveto{\pgfqpoint{1.948cm}{1.101cm}}{\pgfqpoint{1.933cm}{1.136cm}}{\pgfqpoint{1.907cm}{1.162cm}}
\pgfpathcurveto{\pgfqpoint{1.882cm}{1.187cm}}{\pgfqpoint{1.847cm}{1.202cm}}{\pgfqpoint{1.811cm}{1.202cm}}
\pgfpathcurveto{\pgfqpoint{1.775cm}{1.202cm}}{\pgfqpoint{1.74cm}{1.187cm}}{\pgfqpoint{1.714cm}{1.162cm}}
\pgfpathcurveto{\pgfqpoint{1.689cm}{1.136cm}}{\pgfqpoint{1.674cm}{1.101cm}}{\pgfqpoint{1.674cm}{1.065cm}}
\pgfpathcurveto{\pgfqpoint{1.674cm}{1.029cm}}{\pgfqpoint{1.689cm}{0.994cm}}{\pgfqpoint{1.714cm}{0.968cm}}
\pgfpathcurveto{\pgfqpoint{1.74cm}{0.942cm}}{\pgfqpoint{1.775cm}{0.928cm}}{\pgfqpoint{1.811cm}{0.928cm}}
\pgfpathcurveto{\pgfqpoint{1.847cm}{0.928cm}}{\pgfqpoint{1.882cm}{0.942cm}}{\pgfqpoint{1.907cm}{0.968cm}}
\pgfpathcurveto{\pgfqpoint{1.933cm}{0.994cm}}{\pgfqpoint{1.948cm}{1.029cm}}{\pgfqpoint{1.948cm}{1.065cm}}
\pgfusepath{fill}
\begin{pgfscope}
\pgfsetdash{}{0cm}
\pgfsetlinewidth{0.818mm}
\pgfsetmiterlimit{4.0}
\pgfpathmoveto{\pgfqpoint{1.383cm}{0.178cm}}
\pgfpathcurveto{\pgfqpoint{1.383cm}{0.214cm}}{\pgfqpoint{1.369cm}{0.249cm}}{\pgfqpoint{1.343cm}{0.275cm}}
\pgfpathcurveto{\pgfqpoint{1.317cm}{0.3cm}}{\pgfqpoint{1.283cm}{0.315cm}}{\pgfqpoint{1.246cm}{0.315cm}}
\pgfpathcurveto{\pgfqpoint{1.21cm}{0.315cm}}{\pgfqpoint{1.175cm}{0.3cm}}{\pgfqpoint{1.15cm}{0.275cm}}
\pgfpathcurveto{\pgfqpoint{1.124cm}{0.249cm}}{\pgfqpoint{1.11cm}{0.214cm}}{\pgfqpoint{1.11cm}{0.178cm}}
\pgfpathcurveto{\pgfqpoint{1.11cm}{0.141cm}}{\pgfqpoint{1.124cm}{0.107cm}}{\pgfqpoint{1.15cm}{0.081cm}}
\pgfpathcurveto{\pgfqpoint{1.175cm}{0.055cm}}{\pgfqpoint{1.21cm}{0.041cm}}{\pgfqpoint{1.246cm}{0.041cm}}
\pgfpathcurveto{\pgfqpoint{1.283cm}{0.041cm}}{\pgfqpoint{1.317cm}{0.055cm}}{\pgfqpoint{1.343cm}{0.081cm}}
\pgfpathcurveto{\pgfqpoint{1.369cm}{0.107cm}}{\pgfqpoint{1.383cm}{0.141cm}}{\pgfqpoint{1.383cm}{0.178cm}}
\pgfusepath{stroke}
\end{pgfscope}
\end{pgfscope}
\end{pgfscope}
\end{pgfscope}
\end{tikzpicture}}}}+\rmb{3\UU_\leqslant X\succ (\phi+\psi)^2}+\rmb{3 X\preccurlyeq(\phi+\psi)^2}.
\end{align*}
We require that, separately,
\begin{align}\label{eq:two}
 \Q \phi + \rmm{\Phi} = 0, \qquad \Q \psi + \psi^3 + \rmb{\Psi} = 0. 
 \end{align}
Note that in order to have the term $\llbracket X^2\rrbracket\circ\vartheta$ well-defined, it is necessary that $\vartheta$ is at least of regularity $1+\alpha$ for some $\alpha>0.$  This will be shown below.

\subsection{Bound for $\phi$ in $\CC^\alpha(\rho)$}
\label{ssec:phi1}

At this point we only consider the equation for $\phi$ and intend to show that it belongs to $\CC^{\alpha}(\rho)$ for some $\alpha>0$.
Therefore we aim to estimate $\rmm{\Phi}$ in $\CC^{-2+\alpha}(\rho)$. Recall that before we included the localization operators $\UU_>,$ $\UU_\leqslant$ above, all the magenta and all the orange terms were actually better, namely, of regularity at least $-1-\kappa$. Thanks to the operator $\UU_>$ we are able to profit from this difference of actual and wanted regularity. More precisely, we gain a small factor in all the terms in $\rmm{\Phi}$ containing $\phi+\psi$. As a consequence, a suitable choice of the parameter $L$ in the construction of $\UU_>,$ $\UU_\leqslant,$ yields  a bound for $\rmm{\Phi}$ that only depends on the data of the problem but not on the solution. 

\rmbb{Fix a parameter $K>0$ which will be chosen at the end of this subsection depending on the $L^{\infty}$-norm of $\phi+\psi$. As the next step, given $K$, we shall determine the precise value of $L$ for each application of the localization operators $\UU_{\leq},\UU_{>}$.} First of all, we observe that all the magenta terms that do not contain $\phi+\psi$ can be bounded in $\CC^{-1-\kappa}(\rho^\sigma)$. For the remaining terms, it holds
\begin{align*}
&\|3\UU_>\llbracket X^2 \rrbracket\succ(\phi+\psi)\|_{\CC^{-2+\alpha}(\rho)}+\|3\UU_>\llbracket X^2 \rrbracket\prec(\phi+\psi)\|_{\CC^{-2+\alpha}(\rho)}\\&\qquad\lesssim \|\phi+\psi\|_{L^\infty(\rho)}\|\UU_>\llbracket X^2 \rrbracket\|_{\CC^{-2+\alpha}}\lesssim 2^{-(1-\alpha-\kappa)K} \|\phi+\psi\|_{L^\infty(\rho)},
\end{align*}
provided
\begin{equation}\label{eq:l1}
\|\UU_> \llbracket X^2 \rrbracket\|_{\CC^{-2+\alpha}}\lesssim 2^{-(1-\alpha-\kappa)K} \|\llbracket X^2 \rrbracket\|_{\CC^{-1-\kappa}(\rho^\sigma)}.
\end{equation}
Similarly,
\begin{align*}
\|3( \phi + \psi)\prec\UU_>X^{\!\resizebox{!}{.8em}{
\begin{tikzpicture}
\pgfpathmoveto{\pgfqpoint{0cm}{-0.035cm}}
\pgfpathlineto{\pgfqpoint{1.976cm}{-0.035cm}}
\pgfpathlineto{\pgfqpoint{1.976cm}{1.94cm}}
\pgfpathlineto{\pgfqpoint{0cm}{1.94cm}}
\pgfpathclose
\pgfusepath{clip}
\begin{pgfscope}
\begin{pgfscope}
\pgfpathmoveto{\pgfqpoint{0cm}{-0.035cm}}
\pgfpathlineto{\pgfqpoint{1.976cm}{-0.035cm}}
\pgfpathlineto{\pgfqpoint{1.976cm}{1.94cm}}
\pgfpathlineto{\pgfqpoint{0cm}{1.94cm}}
\pgfpathclose
\pgfusepath{clip}
\begin{pgfscope}
\begin{pgfscope}
\pgfsetdash{}{0cm}
\pgfsetlinewidth{0.818mm}
\pgfsetroundcap
\pgfsetroundjoin
\pgfsetmiterlimit{7.0}
\definecolor{eps2pgf_color}{gray}{0}\pgfsetstrokecolor{eps2pgf_color}\pgfsetfillcolor{eps2pgf_color}
\pgfpathmoveto{\pgfqpoint{0.117cm}{1.815cm}}
\pgfpathlineto{\pgfqpoint{0.682cm}{1.065cm}}
\pgfpathlineto{\pgfqpoint{1.246cm}{1.815cm}}
\pgfusepath{stroke}
\end{pgfscope}
\definecolor{eps2pgf_color}{gray}{0}\pgfsetstrokecolor{eps2pgf_color}\pgfsetfillcolor{eps2pgf_color}
\pgfpathmoveto{\pgfqpoint{0.273cm}{1.789cm}}
\pgfpathcurveto{\pgfqpoint{0.273cm}{1.825cm}}{\pgfqpoint{0.259cm}{1.86cm}}{\pgfqpoint{0.233cm}{1.886cm}}
\pgfpathcurveto{\pgfqpoint{0.207cm}{1.912cm}}{\pgfqpoint{0.173cm}{1.926cm}}{\pgfqpoint{0.137cm}{1.926cm}}
\pgfpathcurveto{\pgfqpoint{0.1cm}{1.926cm}}{\pgfqpoint{0.066cm}{1.912cm}}{\pgfqpoint{0.04cm}{1.886cm}}
\pgfpathcurveto{\pgfqpoint{0.014cm}{1.86cm}}{\pgfqpoint{0cm}{1.825cm}}{\pgfqpoint{0cm}{1.789cm}}
\pgfpathcurveto{\pgfqpoint{0cm}{1.753cm}}{\pgfqpoint{0.014cm}{1.718cm}}{\pgfqpoint{0.04cm}{1.692cm}}
\pgfpathcurveto{\pgfqpoint{0.066cm}{1.667cm}}{\pgfqpoint{0.1cm}{1.652cm}}{\pgfqpoint{0.137cm}{1.652cm}}
\pgfpathcurveto{\pgfqpoint{0.173cm}{1.652cm}}{\pgfqpoint{0.207cm}{1.667cm}}{\pgfqpoint{0.233cm}{1.692cm}}
\pgfpathcurveto{\pgfqpoint{0.259cm}{1.718cm}}{\pgfqpoint{0.273cm}{1.753cm}}{\pgfqpoint{0.273cm}{1.789cm}}
\pgfusepath{fill}
\pgfpathmoveto{\pgfqpoint{1.345cm}{1.765cm}}
\pgfpathcurveto{\pgfqpoint{1.345cm}{1.801cm}}{\pgfqpoint{1.331cm}{1.836cm}}{\pgfqpoint{1.305cm}{1.862cm}}
\pgfpathcurveto{\pgfqpoint{1.28cm}{1.887cm}}{\pgfqpoint{1.245cm}{1.902cm}}{\pgfqpoint{1.209cm}{1.902cm}}
\pgfpathcurveto{\pgfqpoint{1.172cm}{1.902cm}}{\pgfqpoint{1.138cm}{1.887cm}}{\pgfqpoint{1.112cm}{1.862cm}}
\pgfpathcurveto{\pgfqpoint{1.087cm}{1.836cm}}{\pgfqpoint{1.072cm}{1.801cm}}{\pgfqpoint{1.072cm}{1.765cm}}
\pgfpathcurveto{\pgfqpoint{1.072cm}{1.728cm}}{\pgfqpoint{1.087cm}{1.694cm}}{\pgfqpoint{1.112cm}{1.668cm}}
\pgfpathcurveto{\pgfqpoint{1.138cm}{1.642cm}}{\pgfqpoint{1.172cm}{1.628cm}}{\pgfqpoint{1.209cm}{1.628cm}}
\pgfpathcurveto{\pgfqpoint{1.245cm}{1.628cm}}{\pgfqpoint{1.28cm}{1.642cm}}{\pgfqpoint{1.305cm}{1.668cm}}
\pgfpathcurveto{\pgfqpoint{1.331cm}{1.694cm}}{\pgfqpoint{1.345cm}{1.728cm}}{\pgfqpoint{1.345cm}{1.765cm}}
\pgfusepath{fill}
\begin{pgfscope}
\pgfsetdash{}{0cm}
\pgfsetlinewidth{0.818mm}
\pgfsetroundcap
\pgfsetroundjoin
\pgfsetmiterlimit{7.0}
\pgfpathmoveto{\pgfqpoint{0.682cm}{1.065cm}}
\pgfpathlineto{\pgfqpoint{1.246cm}{0.315cm}}
\pgfpathlineto{\pgfqpoint{1.811cm}{1.065cm}}
\pgfusepath{stroke}
\end{pgfscope}
\pgfpathmoveto{\pgfqpoint{1.948cm}{1.065cm}}
\pgfpathcurveto{\pgfqpoint{1.948cm}{1.101cm}}{\pgfqpoint{1.933cm}{1.136cm}}{\pgfqpoint{1.907cm}{1.162cm}}
\pgfpathcurveto{\pgfqpoint{1.882cm}{1.187cm}}{\pgfqpoint{1.847cm}{1.202cm}}{\pgfqpoint{1.811cm}{1.202cm}}
\pgfpathcurveto{\pgfqpoint{1.775cm}{1.202cm}}{\pgfqpoint{1.74cm}{1.187cm}}{\pgfqpoint{1.714cm}{1.162cm}}
\pgfpathcurveto{\pgfqpoint{1.689cm}{1.136cm}}{\pgfqpoint{1.674cm}{1.101cm}}{\pgfqpoint{1.674cm}{1.065cm}}
\pgfpathcurveto{\pgfqpoint{1.674cm}{1.029cm}}{\pgfqpoint{1.689cm}{0.994cm}}{\pgfqpoint{1.714cm}{0.968cm}}
\pgfpathcurveto{\pgfqpoint{1.74cm}{0.942cm}}{\pgfqpoint{1.775cm}{0.928cm}}{\pgfqpoint{1.811cm}{0.928cm}}
\pgfpathcurveto{\pgfqpoint{1.847cm}{0.928cm}}{\pgfqpoint{1.882cm}{0.942cm}}{\pgfqpoint{1.907cm}{0.968cm}}
\pgfpathcurveto{\pgfqpoint{1.933cm}{0.994cm}}{\pgfqpoint{1.948cm}{1.029cm}}{\pgfqpoint{1.948cm}{1.065cm}}
\pgfusepath{fill}
\begin{pgfscope}
\pgfsetdash{}{0cm}
\pgfsetlinewidth{0.818mm}
\pgfsetmiterlimit{7.0}
\pgfpathmoveto{\pgfqpoint{1.246cm}{0.315cm}}
\pgfpathlineto{\pgfqpoint{1.244cm}{1.061cm}}
\pgfusepath{stroke}
\end{pgfscope}
\pgfpathmoveto{\pgfqpoint{1.38cm}{1.065cm}}
\pgfpathcurveto{\pgfqpoint{1.38cm}{1.101cm}}{\pgfqpoint{1.366cm}{1.136cm}}{\pgfqpoint{1.34cm}{1.162cm}}
\pgfpathcurveto{\pgfqpoint{1.315cm}{1.187cm}}{\pgfqpoint{1.28cm}{1.202cm}}{\pgfqpoint{1.244cm}{1.202cm}}
\pgfpathcurveto{\pgfqpoint{1.207cm}{1.202cm}}{\pgfqpoint{1.173cm}{1.187cm}}{\pgfqpoint{1.147cm}{1.162cm}}
\pgfpathcurveto{\pgfqpoint{1.121cm}{1.136cm}}{\pgfqpoint{1.107cm}{1.101cm}}{\pgfqpoint{1.107cm}{1.065cm}}
\pgfpathcurveto{\pgfqpoint{1.107cm}{1.029cm}}{\pgfqpoint{1.121cm}{0.994cm}}{\pgfqpoint{1.147cm}{0.968cm}}
\pgfpathcurveto{\pgfqpoint{1.173cm}{0.942cm}}{\pgfqpoint{1.207cm}{0.928cm}}{\pgfqpoint{1.244cm}{0.928cm}}
\pgfpathcurveto{\pgfqpoint{1.28cm}{0.928cm}}{\pgfqpoint{1.315cm}{0.942cm}}{\pgfqpoint{1.34cm}{0.968cm}}
\pgfpathcurveto{\pgfqpoint{1.366cm}{0.994cm}}{\pgfqpoint{1.38cm}{1.029cm}}{\pgfqpoint{1.38cm}{1.065cm}}
\pgfusepath{fill}
\begin{pgfscope}
\pgfsetdash{}{0cm}
\pgfsetlinewidth{0.818mm}
\pgfsetmiterlimit{4.0}
\pgfpathmoveto{\pgfqpoint{1.383cm}{0.178cm}}
\pgfpathcurveto{\pgfqpoint{1.383cm}{0.214cm}}{\pgfqpoint{1.369cm}{0.249cm}}{\pgfqpoint{1.343cm}{0.275cm}}
\pgfpathcurveto{\pgfqpoint{1.317cm}{0.3cm}}{\pgfqpoint{1.283cm}{0.315cm}}{\pgfqpoint{1.246cm}{0.315cm}}
\pgfpathcurveto{\pgfqpoint{1.21cm}{0.315cm}}{\pgfqpoint{1.175cm}{0.3cm}}{\pgfqpoint{1.15cm}{0.275cm}}
\pgfpathcurveto{\pgfqpoint{1.124cm}{0.249cm}}{\pgfqpoint{1.11cm}{0.214cm}}{\pgfqpoint{1.11cm}{0.178cm}}
\pgfpathcurveto{\pgfqpoint{1.11cm}{0.141cm}}{\pgfqpoint{1.124cm}{0.107cm}}{\pgfqpoint{1.15cm}{0.081cm}}
\pgfpathcurveto{\pgfqpoint{1.175cm}{0.055cm}}{\pgfqpoint{1.21cm}{0.041cm}}{\pgfqpoint{1.246cm}{0.041cm}}
\pgfpathcurveto{\pgfqpoint{1.283cm}{0.041cm}}{\pgfqpoint{1.317cm}{0.055cm}}{\pgfqpoint{1.343cm}{0.081cm}}
\pgfpathcurveto{\pgfqpoint{1.369cm}{0.107cm}}{\pgfqpoint{1.383cm}{0.141cm}}{\pgfqpoint{1.383cm}{0.178cm}}
\pgfusepath{stroke}
\end{pgfscope}
\end{pgfscope}
\end{pgfscope}
\end{pgfscope}
\end{tikzpicture}}}\|_{\CC^{-2+\alpha}(\rho)}
\lesssim  \|\phi+\psi\|_{L^\infty(\rho)}\|\UU_>X^{\!\resizebox{!}{.8em}{
\begin{tikzpicture}
\pgfpathmoveto{\pgfqpoint{0cm}{-0.035cm}}
\pgfpathlineto{\pgfqpoint{1.976cm}{-0.035cm}}
\pgfpathlineto{\pgfqpoint{1.976cm}{1.94cm}}
\pgfpathlineto{\pgfqpoint{0cm}{1.94cm}}
\pgfpathclose
\pgfusepath{clip}
\begin{pgfscope}
\begin{pgfscope}
\pgfpathmoveto{\pgfqpoint{0cm}{-0.035cm}}
\pgfpathlineto{\pgfqpoint{1.976cm}{-0.035cm}}
\pgfpathlineto{\pgfqpoint{1.976cm}{1.94cm}}
\pgfpathlineto{\pgfqpoint{0cm}{1.94cm}}
\pgfpathclose
\pgfusepath{clip}
\begin{pgfscope}
\begin{pgfscope}
\pgfsetdash{}{0cm}
\pgfsetlinewidth{0.818mm}
\pgfsetroundcap
\pgfsetroundjoin
\pgfsetmiterlimit{7.0}
\definecolor{eps2pgf_color}{gray}{0}\pgfsetstrokecolor{eps2pgf_color}\pgfsetfillcolor{eps2pgf_color}
\pgfpathmoveto{\pgfqpoint{0.117cm}{1.815cm}}
\pgfpathlineto{\pgfqpoint{0.682cm}{1.065cm}}
\pgfpathlineto{\pgfqpoint{1.246cm}{1.815cm}}
\pgfusepath{stroke}
\end{pgfscope}
\definecolor{eps2pgf_color}{gray}{0}\pgfsetstrokecolor{eps2pgf_color}\pgfsetfillcolor{eps2pgf_color}
\pgfpathmoveto{\pgfqpoint{0.273cm}{1.789cm}}
\pgfpathcurveto{\pgfqpoint{0.273cm}{1.825cm}}{\pgfqpoint{0.259cm}{1.86cm}}{\pgfqpoint{0.233cm}{1.886cm}}
\pgfpathcurveto{\pgfqpoint{0.207cm}{1.912cm}}{\pgfqpoint{0.173cm}{1.926cm}}{\pgfqpoint{0.137cm}{1.926cm}}
\pgfpathcurveto{\pgfqpoint{0.1cm}{1.926cm}}{\pgfqpoint{0.066cm}{1.912cm}}{\pgfqpoint{0.04cm}{1.886cm}}
\pgfpathcurveto{\pgfqpoint{0.014cm}{1.86cm}}{\pgfqpoint{0cm}{1.825cm}}{\pgfqpoint{0cm}{1.789cm}}
\pgfpathcurveto{\pgfqpoint{0cm}{1.753cm}}{\pgfqpoint{0.014cm}{1.718cm}}{\pgfqpoint{0.04cm}{1.692cm}}
\pgfpathcurveto{\pgfqpoint{0.066cm}{1.667cm}}{\pgfqpoint{0.1cm}{1.652cm}}{\pgfqpoint{0.137cm}{1.652cm}}
\pgfpathcurveto{\pgfqpoint{0.173cm}{1.652cm}}{\pgfqpoint{0.207cm}{1.667cm}}{\pgfqpoint{0.233cm}{1.692cm}}
\pgfpathcurveto{\pgfqpoint{0.259cm}{1.718cm}}{\pgfqpoint{0.273cm}{1.753cm}}{\pgfqpoint{0.273cm}{1.789cm}}
\pgfusepath{fill}
\pgfpathmoveto{\pgfqpoint{1.345cm}{1.765cm}}
\pgfpathcurveto{\pgfqpoint{1.345cm}{1.801cm}}{\pgfqpoint{1.331cm}{1.836cm}}{\pgfqpoint{1.305cm}{1.862cm}}
\pgfpathcurveto{\pgfqpoint{1.28cm}{1.887cm}}{\pgfqpoint{1.245cm}{1.902cm}}{\pgfqpoint{1.209cm}{1.902cm}}
\pgfpathcurveto{\pgfqpoint{1.172cm}{1.902cm}}{\pgfqpoint{1.138cm}{1.887cm}}{\pgfqpoint{1.112cm}{1.862cm}}
\pgfpathcurveto{\pgfqpoint{1.087cm}{1.836cm}}{\pgfqpoint{1.072cm}{1.801cm}}{\pgfqpoint{1.072cm}{1.765cm}}
\pgfpathcurveto{\pgfqpoint{1.072cm}{1.728cm}}{\pgfqpoint{1.087cm}{1.694cm}}{\pgfqpoint{1.112cm}{1.668cm}}
\pgfpathcurveto{\pgfqpoint{1.138cm}{1.642cm}}{\pgfqpoint{1.172cm}{1.628cm}}{\pgfqpoint{1.209cm}{1.628cm}}
\pgfpathcurveto{\pgfqpoint{1.245cm}{1.628cm}}{\pgfqpoint{1.28cm}{1.642cm}}{\pgfqpoint{1.305cm}{1.668cm}}
\pgfpathcurveto{\pgfqpoint{1.331cm}{1.694cm}}{\pgfqpoint{1.345cm}{1.728cm}}{\pgfqpoint{1.345cm}{1.765cm}}
\pgfusepath{fill}
\begin{pgfscope}
\pgfsetdash{}{0cm}
\pgfsetlinewidth{0.818mm}
\pgfsetroundcap
\pgfsetroundjoin
\pgfsetmiterlimit{7.0}
\pgfpathmoveto{\pgfqpoint{0.682cm}{1.065cm}}
\pgfpathlineto{\pgfqpoint{1.246cm}{0.315cm}}
\pgfpathlineto{\pgfqpoint{1.811cm}{1.065cm}}
\pgfusepath{stroke}
\end{pgfscope}
\pgfpathmoveto{\pgfqpoint{1.948cm}{1.065cm}}
\pgfpathcurveto{\pgfqpoint{1.948cm}{1.101cm}}{\pgfqpoint{1.933cm}{1.136cm}}{\pgfqpoint{1.907cm}{1.162cm}}
\pgfpathcurveto{\pgfqpoint{1.882cm}{1.187cm}}{\pgfqpoint{1.847cm}{1.202cm}}{\pgfqpoint{1.811cm}{1.202cm}}
\pgfpathcurveto{\pgfqpoint{1.775cm}{1.202cm}}{\pgfqpoint{1.74cm}{1.187cm}}{\pgfqpoint{1.714cm}{1.162cm}}
\pgfpathcurveto{\pgfqpoint{1.689cm}{1.136cm}}{\pgfqpoint{1.674cm}{1.101cm}}{\pgfqpoint{1.674cm}{1.065cm}}
\pgfpathcurveto{\pgfqpoint{1.674cm}{1.029cm}}{\pgfqpoint{1.689cm}{0.994cm}}{\pgfqpoint{1.714cm}{0.968cm}}
\pgfpathcurveto{\pgfqpoint{1.74cm}{0.942cm}}{\pgfqpoint{1.775cm}{0.928cm}}{\pgfqpoint{1.811cm}{0.928cm}}
\pgfpathcurveto{\pgfqpoint{1.847cm}{0.928cm}}{\pgfqpoint{1.882cm}{0.942cm}}{\pgfqpoint{1.907cm}{0.968cm}}
\pgfpathcurveto{\pgfqpoint{1.933cm}{0.994cm}}{\pgfqpoint{1.948cm}{1.029cm}}{\pgfqpoint{1.948cm}{1.065cm}}
\pgfusepath{fill}
\begin{pgfscope}
\pgfsetdash{}{0cm}
\pgfsetlinewidth{0.818mm}
\pgfsetmiterlimit{7.0}
\pgfpathmoveto{\pgfqpoint{1.246cm}{0.315cm}}
\pgfpathlineto{\pgfqpoint{1.244cm}{1.061cm}}
\pgfusepath{stroke}
\end{pgfscope}
\pgfpathmoveto{\pgfqpoint{1.38cm}{1.065cm}}
\pgfpathcurveto{\pgfqpoint{1.38cm}{1.101cm}}{\pgfqpoint{1.366cm}{1.136cm}}{\pgfqpoint{1.34cm}{1.162cm}}
\pgfpathcurveto{\pgfqpoint{1.315cm}{1.187cm}}{\pgfqpoint{1.28cm}{1.202cm}}{\pgfqpoint{1.244cm}{1.202cm}}
\pgfpathcurveto{\pgfqpoint{1.207cm}{1.202cm}}{\pgfqpoint{1.173cm}{1.187cm}}{\pgfqpoint{1.147cm}{1.162cm}}
\pgfpathcurveto{\pgfqpoint{1.121cm}{1.136cm}}{\pgfqpoint{1.107cm}{1.101cm}}{\pgfqpoint{1.107cm}{1.065cm}}
\pgfpathcurveto{\pgfqpoint{1.107cm}{1.029cm}}{\pgfqpoint{1.121cm}{0.994cm}}{\pgfqpoint{1.147cm}{0.968cm}}
\pgfpathcurveto{\pgfqpoint{1.173cm}{0.942cm}}{\pgfqpoint{1.207cm}{0.928cm}}{\pgfqpoint{1.244cm}{0.928cm}}
\pgfpathcurveto{\pgfqpoint{1.28cm}{0.928cm}}{\pgfqpoint{1.315cm}{0.942cm}}{\pgfqpoint{1.34cm}{0.968cm}}
\pgfpathcurveto{\pgfqpoint{1.366cm}{0.994cm}}{\pgfqpoint{1.38cm}{1.029cm}}{\pgfqpoint{1.38cm}{1.065cm}}
\pgfusepath{fill}
\begin{pgfscope}
\pgfsetdash{}{0cm}
\pgfsetlinewidth{0.818mm}
\pgfsetmiterlimit{4.0}
\pgfpathmoveto{\pgfqpoint{1.383cm}{0.178cm}}
\pgfpathcurveto{\pgfqpoint{1.383cm}{0.214cm}}{\pgfqpoint{1.369cm}{0.249cm}}{\pgfqpoint{1.343cm}{0.275cm}}
\pgfpathcurveto{\pgfqpoint{1.317cm}{0.3cm}}{\pgfqpoint{1.283cm}{0.315cm}}{\pgfqpoint{1.246cm}{0.315cm}}
\pgfpathcurveto{\pgfqpoint{1.21cm}{0.315cm}}{\pgfqpoint{1.175cm}{0.3cm}}{\pgfqpoint{1.15cm}{0.275cm}}
\pgfpathcurveto{\pgfqpoint{1.124cm}{0.249cm}}{\pgfqpoint{1.11cm}{0.214cm}}{\pgfqpoint{1.11cm}{0.178cm}}
\pgfpathcurveto{\pgfqpoint{1.11cm}{0.141cm}}{\pgfqpoint{1.124cm}{0.107cm}}{\pgfqpoint{1.15cm}{0.081cm}}
\pgfpathcurveto{\pgfqpoint{1.175cm}{0.055cm}}{\pgfqpoint{1.21cm}{0.041cm}}{\pgfqpoint{1.246cm}{0.041cm}}
\pgfpathcurveto{\pgfqpoint{1.283cm}{0.041cm}}{\pgfqpoint{1.317cm}{0.055cm}}{\pgfqpoint{1.343cm}{0.081cm}}
\pgfpathcurveto{\pgfqpoint{1.369cm}{0.107cm}}{\pgfqpoint{1.383cm}{0.141cm}}{\pgfqpoint{1.383cm}{0.178cm}}
\pgfusepath{stroke}
\end{pgfscope}
\end{pgfscope}
\end{pgfscope}
\end{pgfscope}
\end{tikzpicture}}}\|_{\CC^{-2+\alpha}}\lesssim 2^{-(2-\alpha-\kappa)K/2}\|\phi+\psi\|_{L^\infty(\rho)},
\end{align*}
provided
\begin{equation}\label{eq:l2}
\|\UU_> X^{\!\resizebox{!}{.8em}{
\begin{tikzpicture}
\pgfpathmoveto{\pgfqpoint{0cm}{-0.035cm}}
\pgfpathlineto{\pgfqpoint{1.976cm}{-0.035cm}}
\pgfpathlineto{\pgfqpoint{1.976cm}{1.94cm}}
\pgfpathlineto{\pgfqpoint{0cm}{1.94cm}}
\pgfpathclose
\pgfusepath{clip}
\begin{pgfscope}
\begin{pgfscope}
\pgfpathmoveto{\pgfqpoint{0cm}{-0.035cm}}
\pgfpathlineto{\pgfqpoint{1.976cm}{-0.035cm}}
\pgfpathlineto{\pgfqpoint{1.976cm}{1.94cm}}
\pgfpathlineto{\pgfqpoint{0cm}{1.94cm}}
\pgfpathclose
\pgfusepath{clip}
\begin{pgfscope}
\begin{pgfscope}
\pgfsetdash{}{0cm}
\pgfsetlinewidth{0.818mm}
\pgfsetroundcap
\pgfsetroundjoin
\pgfsetmiterlimit{7.0}
\definecolor{eps2pgf_color}{gray}{0}\pgfsetstrokecolor{eps2pgf_color}\pgfsetfillcolor{eps2pgf_color}
\pgfpathmoveto{\pgfqpoint{0.117cm}{1.815cm}}
\pgfpathlineto{\pgfqpoint{0.682cm}{1.065cm}}
\pgfpathlineto{\pgfqpoint{1.246cm}{1.815cm}}
\pgfusepath{stroke}
\end{pgfscope}
\definecolor{eps2pgf_color}{gray}{0}\pgfsetstrokecolor{eps2pgf_color}\pgfsetfillcolor{eps2pgf_color}
\pgfpathmoveto{\pgfqpoint{0.273cm}{1.789cm}}
\pgfpathcurveto{\pgfqpoint{0.273cm}{1.825cm}}{\pgfqpoint{0.259cm}{1.86cm}}{\pgfqpoint{0.233cm}{1.886cm}}
\pgfpathcurveto{\pgfqpoint{0.207cm}{1.912cm}}{\pgfqpoint{0.173cm}{1.926cm}}{\pgfqpoint{0.137cm}{1.926cm}}
\pgfpathcurveto{\pgfqpoint{0.1cm}{1.926cm}}{\pgfqpoint{0.066cm}{1.912cm}}{\pgfqpoint{0.04cm}{1.886cm}}
\pgfpathcurveto{\pgfqpoint{0.014cm}{1.86cm}}{\pgfqpoint{0cm}{1.825cm}}{\pgfqpoint{0cm}{1.789cm}}
\pgfpathcurveto{\pgfqpoint{0cm}{1.753cm}}{\pgfqpoint{0.014cm}{1.718cm}}{\pgfqpoint{0.04cm}{1.692cm}}
\pgfpathcurveto{\pgfqpoint{0.066cm}{1.667cm}}{\pgfqpoint{0.1cm}{1.652cm}}{\pgfqpoint{0.137cm}{1.652cm}}
\pgfpathcurveto{\pgfqpoint{0.173cm}{1.652cm}}{\pgfqpoint{0.207cm}{1.667cm}}{\pgfqpoint{0.233cm}{1.692cm}}
\pgfpathcurveto{\pgfqpoint{0.259cm}{1.718cm}}{\pgfqpoint{0.273cm}{1.753cm}}{\pgfqpoint{0.273cm}{1.789cm}}
\pgfusepath{fill}
\pgfpathmoveto{\pgfqpoint{1.345cm}{1.765cm}}
\pgfpathcurveto{\pgfqpoint{1.345cm}{1.801cm}}{\pgfqpoint{1.331cm}{1.836cm}}{\pgfqpoint{1.305cm}{1.862cm}}
\pgfpathcurveto{\pgfqpoint{1.28cm}{1.887cm}}{\pgfqpoint{1.245cm}{1.902cm}}{\pgfqpoint{1.209cm}{1.902cm}}
\pgfpathcurveto{\pgfqpoint{1.172cm}{1.902cm}}{\pgfqpoint{1.138cm}{1.887cm}}{\pgfqpoint{1.112cm}{1.862cm}}
\pgfpathcurveto{\pgfqpoint{1.087cm}{1.836cm}}{\pgfqpoint{1.072cm}{1.801cm}}{\pgfqpoint{1.072cm}{1.765cm}}
\pgfpathcurveto{\pgfqpoint{1.072cm}{1.728cm}}{\pgfqpoint{1.087cm}{1.694cm}}{\pgfqpoint{1.112cm}{1.668cm}}
\pgfpathcurveto{\pgfqpoint{1.138cm}{1.642cm}}{\pgfqpoint{1.172cm}{1.628cm}}{\pgfqpoint{1.209cm}{1.628cm}}
\pgfpathcurveto{\pgfqpoint{1.245cm}{1.628cm}}{\pgfqpoint{1.28cm}{1.642cm}}{\pgfqpoint{1.305cm}{1.668cm}}
\pgfpathcurveto{\pgfqpoint{1.331cm}{1.694cm}}{\pgfqpoint{1.345cm}{1.728cm}}{\pgfqpoint{1.345cm}{1.765cm}}
\pgfusepath{fill}
\begin{pgfscope}
\pgfsetdash{}{0cm}
\pgfsetlinewidth{0.818mm}
\pgfsetroundcap
\pgfsetroundjoin
\pgfsetmiterlimit{7.0}
\pgfpathmoveto{\pgfqpoint{0.682cm}{1.065cm}}
\pgfpathlineto{\pgfqpoint{1.246cm}{0.315cm}}
\pgfpathlineto{\pgfqpoint{1.811cm}{1.065cm}}
\pgfusepath{stroke}
\end{pgfscope}
\pgfpathmoveto{\pgfqpoint{1.948cm}{1.065cm}}
\pgfpathcurveto{\pgfqpoint{1.948cm}{1.101cm}}{\pgfqpoint{1.933cm}{1.136cm}}{\pgfqpoint{1.907cm}{1.162cm}}
\pgfpathcurveto{\pgfqpoint{1.882cm}{1.187cm}}{\pgfqpoint{1.847cm}{1.202cm}}{\pgfqpoint{1.811cm}{1.202cm}}
\pgfpathcurveto{\pgfqpoint{1.775cm}{1.202cm}}{\pgfqpoint{1.74cm}{1.187cm}}{\pgfqpoint{1.714cm}{1.162cm}}
\pgfpathcurveto{\pgfqpoint{1.689cm}{1.136cm}}{\pgfqpoint{1.674cm}{1.101cm}}{\pgfqpoint{1.674cm}{1.065cm}}
\pgfpathcurveto{\pgfqpoint{1.674cm}{1.029cm}}{\pgfqpoint{1.689cm}{0.994cm}}{\pgfqpoint{1.714cm}{0.968cm}}
\pgfpathcurveto{\pgfqpoint{1.74cm}{0.942cm}}{\pgfqpoint{1.775cm}{0.928cm}}{\pgfqpoint{1.811cm}{0.928cm}}
\pgfpathcurveto{\pgfqpoint{1.847cm}{0.928cm}}{\pgfqpoint{1.882cm}{0.942cm}}{\pgfqpoint{1.907cm}{0.968cm}}
\pgfpathcurveto{\pgfqpoint{1.933cm}{0.994cm}}{\pgfqpoint{1.948cm}{1.029cm}}{\pgfqpoint{1.948cm}{1.065cm}}
\pgfusepath{fill}
\begin{pgfscope}
\pgfsetdash{}{0cm}
\pgfsetlinewidth{0.818mm}
\pgfsetmiterlimit{7.0}
\pgfpathmoveto{\pgfqpoint{1.246cm}{0.315cm}}
\pgfpathlineto{\pgfqpoint{1.244cm}{1.061cm}}
\pgfusepath{stroke}
\end{pgfscope}
\pgfpathmoveto{\pgfqpoint{1.38cm}{1.065cm}}
\pgfpathcurveto{\pgfqpoint{1.38cm}{1.101cm}}{\pgfqpoint{1.366cm}{1.136cm}}{\pgfqpoint{1.34cm}{1.162cm}}
\pgfpathcurveto{\pgfqpoint{1.315cm}{1.187cm}}{\pgfqpoint{1.28cm}{1.202cm}}{\pgfqpoint{1.244cm}{1.202cm}}
\pgfpathcurveto{\pgfqpoint{1.207cm}{1.202cm}}{\pgfqpoint{1.173cm}{1.187cm}}{\pgfqpoint{1.147cm}{1.162cm}}
\pgfpathcurveto{\pgfqpoint{1.121cm}{1.136cm}}{\pgfqpoint{1.107cm}{1.101cm}}{\pgfqpoint{1.107cm}{1.065cm}}
\pgfpathcurveto{\pgfqpoint{1.107cm}{1.029cm}}{\pgfqpoint{1.121cm}{0.994cm}}{\pgfqpoint{1.147cm}{0.968cm}}
\pgfpathcurveto{\pgfqpoint{1.173cm}{0.942cm}}{\pgfqpoint{1.207cm}{0.928cm}}{\pgfqpoint{1.244cm}{0.928cm}}
\pgfpathcurveto{\pgfqpoint{1.28cm}{0.928cm}}{\pgfqpoint{1.315cm}{0.942cm}}{\pgfqpoint{1.34cm}{0.968cm}}
\pgfpathcurveto{\pgfqpoint{1.366cm}{0.994cm}}{\pgfqpoint{1.38cm}{1.029cm}}{\pgfqpoint{1.38cm}{1.065cm}}
\pgfusepath{fill}
\begin{pgfscope}
\pgfsetdash{}{0cm}
\pgfsetlinewidth{0.818mm}
\pgfsetmiterlimit{4.0}
\pgfpathmoveto{\pgfqpoint{1.383cm}{0.178cm}}
\pgfpathcurveto{\pgfqpoint{1.383cm}{0.214cm}}{\pgfqpoint{1.369cm}{0.249cm}}{\pgfqpoint{1.343cm}{0.275cm}}
\pgfpathcurveto{\pgfqpoint{1.317cm}{0.3cm}}{\pgfqpoint{1.283cm}{0.315cm}}{\pgfqpoint{1.246cm}{0.315cm}}
\pgfpathcurveto{\pgfqpoint{1.21cm}{0.315cm}}{\pgfqpoint{1.175cm}{0.3cm}}{\pgfqpoint{1.15cm}{0.275cm}}
\pgfpathcurveto{\pgfqpoint{1.124cm}{0.249cm}}{\pgfqpoint{1.11cm}{0.214cm}}{\pgfqpoint{1.11cm}{0.178cm}}
\pgfpathcurveto{\pgfqpoint{1.11cm}{0.141cm}}{\pgfqpoint{1.124cm}{0.107cm}}{\pgfqpoint{1.15cm}{0.081cm}}
\pgfpathcurveto{\pgfqpoint{1.175cm}{0.055cm}}{\pgfqpoint{1.21cm}{0.041cm}}{\pgfqpoint{1.246cm}{0.041cm}}
\pgfpathcurveto{\pgfqpoint{1.283cm}{0.041cm}}{\pgfqpoint{1.317cm}{0.055cm}}{\pgfqpoint{1.343cm}{0.081cm}}
\pgfpathcurveto{\pgfqpoint{1.369cm}{0.107cm}}{\pgfqpoint{1.383cm}{0.141cm}}{\pgfqpoint{1.383cm}{0.178cm}}
\pgfusepath{stroke}
\end{pgfscope}
\end{pgfscope}
\end{pgfscope}
\end{pgfscope}
\end{tikzpicture}}}\|_{\CC^{-2+\alpha}}\lesssim 2^{-(2-\alpha-\kappa)K/2} \|X^{\!\resizebox{!}{.8em}{
\begin{tikzpicture}
\pgfpathmoveto{\pgfqpoint{0cm}{-0.035cm}}
\pgfpathlineto{\pgfqpoint{1.976cm}{-0.035cm}}
\pgfpathlineto{\pgfqpoint{1.976cm}{1.94cm}}
\pgfpathlineto{\pgfqpoint{0cm}{1.94cm}}
\pgfpathclose
\pgfusepath{clip}
\begin{pgfscope}
\begin{pgfscope}
\pgfpathmoveto{\pgfqpoint{0cm}{-0.035cm}}
\pgfpathlineto{\pgfqpoint{1.976cm}{-0.035cm}}
\pgfpathlineto{\pgfqpoint{1.976cm}{1.94cm}}
\pgfpathlineto{\pgfqpoint{0cm}{1.94cm}}
\pgfpathclose
\pgfusepath{clip}
\begin{pgfscope}
\begin{pgfscope}
\pgfsetdash{}{0cm}
\pgfsetlinewidth{0.818mm}
\pgfsetroundcap
\pgfsetroundjoin
\pgfsetmiterlimit{7.0}
\definecolor{eps2pgf_color}{gray}{0}\pgfsetstrokecolor{eps2pgf_color}\pgfsetfillcolor{eps2pgf_color}
\pgfpathmoveto{\pgfqpoint{0.117cm}{1.815cm}}
\pgfpathlineto{\pgfqpoint{0.682cm}{1.065cm}}
\pgfpathlineto{\pgfqpoint{1.246cm}{1.815cm}}
\pgfusepath{stroke}
\end{pgfscope}
\definecolor{eps2pgf_color}{gray}{0}\pgfsetstrokecolor{eps2pgf_color}\pgfsetfillcolor{eps2pgf_color}
\pgfpathmoveto{\pgfqpoint{0.273cm}{1.789cm}}
\pgfpathcurveto{\pgfqpoint{0.273cm}{1.825cm}}{\pgfqpoint{0.259cm}{1.86cm}}{\pgfqpoint{0.233cm}{1.886cm}}
\pgfpathcurveto{\pgfqpoint{0.207cm}{1.912cm}}{\pgfqpoint{0.173cm}{1.926cm}}{\pgfqpoint{0.137cm}{1.926cm}}
\pgfpathcurveto{\pgfqpoint{0.1cm}{1.926cm}}{\pgfqpoint{0.066cm}{1.912cm}}{\pgfqpoint{0.04cm}{1.886cm}}
\pgfpathcurveto{\pgfqpoint{0.014cm}{1.86cm}}{\pgfqpoint{0cm}{1.825cm}}{\pgfqpoint{0cm}{1.789cm}}
\pgfpathcurveto{\pgfqpoint{0cm}{1.753cm}}{\pgfqpoint{0.014cm}{1.718cm}}{\pgfqpoint{0.04cm}{1.692cm}}
\pgfpathcurveto{\pgfqpoint{0.066cm}{1.667cm}}{\pgfqpoint{0.1cm}{1.652cm}}{\pgfqpoint{0.137cm}{1.652cm}}
\pgfpathcurveto{\pgfqpoint{0.173cm}{1.652cm}}{\pgfqpoint{0.207cm}{1.667cm}}{\pgfqpoint{0.233cm}{1.692cm}}
\pgfpathcurveto{\pgfqpoint{0.259cm}{1.718cm}}{\pgfqpoint{0.273cm}{1.753cm}}{\pgfqpoint{0.273cm}{1.789cm}}
\pgfusepath{fill}
\pgfpathmoveto{\pgfqpoint{1.345cm}{1.765cm}}
\pgfpathcurveto{\pgfqpoint{1.345cm}{1.801cm}}{\pgfqpoint{1.331cm}{1.836cm}}{\pgfqpoint{1.305cm}{1.862cm}}
\pgfpathcurveto{\pgfqpoint{1.28cm}{1.887cm}}{\pgfqpoint{1.245cm}{1.902cm}}{\pgfqpoint{1.209cm}{1.902cm}}
\pgfpathcurveto{\pgfqpoint{1.172cm}{1.902cm}}{\pgfqpoint{1.138cm}{1.887cm}}{\pgfqpoint{1.112cm}{1.862cm}}
\pgfpathcurveto{\pgfqpoint{1.087cm}{1.836cm}}{\pgfqpoint{1.072cm}{1.801cm}}{\pgfqpoint{1.072cm}{1.765cm}}
\pgfpathcurveto{\pgfqpoint{1.072cm}{1.728cm}}{\pgfqpoint{1.087cm}{1.694cm}}{\pgfqpoint{1.112cm}{1.668cm}}
\pgfpathcurveto{\pgfqpoint{1.138cm}{1.642cm}}{\pgfqpoint{1.172cm}{1.628cm}}{\pgfqpoint{1.209cm}{1.628cm}}
\pgfpathcurveto{\pgfqpoint{1.245cm}{1.628cm}}{\pgfqpoint{1.28cm}{1.642cm}}{\pgfqpoint{1.305cm}{1.668cm}}
\pgfpathcurveto{\pgfqpoint{1.331cm}{1.694cm}}{\pgfqpoint{1.345cm}{1.728cm}}{\pgfqpoint{1.345cm}{1.765cm}}
\pgfusepath{fill}
\begin{pgfscope}
\pgfsetdash{}{0cm}
\pgfsetlinewidth{0.818mm}
\pgfsetroundcap
\pgfsetroundjoin
\pgfsetmiterlimit{7.0}
\pgfpathmoveto{\pgfqpoint{0.682cm}{1.065cm}}
\pgfpathlineto{\pgfqpoint{1.246cm}{0.315cm}}
\pgfpathlineto{\pgfqpoint{1.811cm}{1.065cm}}
\pgfusepath{stroke}
\end{pgfscope}
\pgfpathmoveto{\pgfqpoint{1.948cm}{1.065cm}}
\pgfpathcurveto{\pgfqpoint{1.948cm}{1.101cm}}{\pgfqpoint{1.933cm}{1.136cm}}{\pgfqpoint{1.907cm}{1.162cm}}
\pgfpathcurveto{\pgfqpoint{1.882cm}{1.187cm}}{\pgfqpoint{1.847cm}{1.202cm}}{\pgfqpoint{1.811cm}{1.202cm}}
\pgfpathcurveto{\pgfqpoint{1.775cm}{1.202cm}}{\pgfqpoint{1.74cm}{1.187cm}}{\pgfqpoint{1.714cm}{1.162cm}}
\pgfpathcurveto{\pgfqpoint{1.689cm}{1.136cm}}{\pgfqpoint{1.674cm}{1.101cm}}{\pgfqpoint{1.674cm}{1.065cm}}
\pgfpathcurveto{\pgfqpoint{1.674cm}{1.029cm}}{\pgfqpoint{1.689cm}{0.994cm}}{\pgfqpoint{1.714cm}{0.968cm}}
\pgfpathcurveto{\pgfqpoint{1.74cm}{0.942cm}}{\pgfqpoint{1.775cm}{0.928cm}}{\pgfqpoint{1.811cm}{0.928cm}}
\pgfpathcurveto{\pgfqpoint{1.847cm}{0.928cm}}{\pgfqpoint{1.882cm}{0.942cm}}{\pgfqpoint{1.907cm}{0.968cm}}
\pgfpathcurveto{\pgfqpoint{1.933cm}{0.994cm}}{\pgfqpoint{1.948cm}{1.029cm}}{\pgfqpoint{1.948cm}{1.065cm}}
\pgfusepath{fill}
\begin{pgfscope}
\pgfsetdash{}{0cm}
\pgfsetlinewidth{0.818mm}
\pgfsetmiterlimit{7.0}
\pgfpathmoveto{\pgfqpoint{1.246cm}{0.315cm}}
\pgfpathlineto{\pgfqpoint{1.244cm}{1.061cm}}
\pgfusepath{stroke}
\end{pgfscope}
\pgfpathmoveto{\pgfqpoint{1.38cm}{1.065cm}}
\pgfpathcurveto{\pgfqpoint{1.38cm}{1.101cm}}{\pgfqpoint{1.366cm}{1.136cm}}{\pgfqpoint{1.34cm}{1.162cm}}
\pgfpathcurveto{\pgfqpoint{1.315cm}{1.187cm}}{\pgfqpoint{1.28cm}{1.202cm}}{\pgfqpoint{1.244cm}{1.202cm}}
\pgfpathcurveto{\pgfqpoint{1.207cm}{1.202cm}}{\pgfqpoint{1.173cm}{1.187cm}}{\pgfqpoint{1.147cm}{1.162cm}}
\pgfpathcurveto{\pgfqpoint{1.121cm}{1.136cm}}{\pgfqpoint{1.107cm}{1.101cm}}{\pgfqpoint{1.107cm}{1.065cm}}
\pgfpathcurveto{\pgfqpoint{1.107cm}{1.029cm}}{\pgfqpoint{1.121cm}{0.994cm}}{\pgfqpoint{1.147cm}{0.968cm}}
\pgfpathcurveto{\pgfqpoint{1.173cm}{0.942cm}}{\pgfqpoint{1.207cm}{0.928cm}}{\pgfqpoint{1.244cm}{0.928cm}}
\pgfpathcurveto{\pgfqpoint{1.28cm}{0.928cm}}{\pgfqpoint{1.315cm}{0.942cm}}{\pgfqpoint{1.34cm}{0.968cm}}
\pgfpathcurveto{\pgfqpoint{1.366cm}{0.994cm}}{\pgfqpoint{1.38cm}{1.029cm}}{\pgfqpoint{1.38cm}{1.065cm}}
\pgfusepath{fill}
\begin{pgfscope}
\pgfsetdash{}{0cm}
\pgfsetlinewidth{0.818mm}
\pgfsetmiterlimit{4.0}
\pgfpathmoveto{\pgfqpoint{1.383cm}{0.178cm}}
\pgfpathcurveto{\pgfqpoint{1.383cm}{0.214cm}}{\pgfqpoint{1.369cm}{0.249cm}}{\pgfqpoint{1.343cm}{0.275cm}}
\pgfpathcurveto{\pgfqpoint{1.317cm}{0.3cm}}{\pgfqpoint{1.283cm}{0.315cm}}{\pgfqpoint{1.246cm}{0.315cm}}
\pgfpathcurveto{\pgfqpoint{1.21cm}{0.315cm}}{\pgfqpoint{1.175cm}{0.3cm}}{\pgfqpoint{1.15cm}{0.275cm}}
\pgfpathcurveto{\pgfqpoint{1.124cm}{0.249cm}}{\pgfqpoint{1.11cm}{0.214cm}}{\pgfqpoint{1.11cm}{0.178cm}}
\pgfpathcurveto{\pgfqpoint{1.11cm}{0.141cm}}{\pgfqpoint{1.124cm}{0.107cm}}{\pgfqpoint{1.15cm}{0.081cm}}
\pgfpathcurveto{\pgfqpoint{1.175cm}{0.055cm}}{\pgfqpoint{1.21cm}{0.041cm}}{\pgfqpoint{1.246cm}{0.041cm}}
\pgfpathcurveto{\pgfqpoint{1.283cm}{0.041cm}}{\pgfqpoint{1.317cm}{0.055cm}}{\pgfqpoint{1.343cm}{0.081cm}}
\pgfpathcurveto{\pgfqpoint{1.369cm}{0.107cm}}{\pgfqpoint{1.383cm}{0.141cm}}{\pgfqpoint{1.383cm}{0.178cm}}
\pgfusepath{stroke}
\end{pgfscope}
\end{pgfscope}
\end{pgfscope}
\end{pgfscope}
\end{tikzpicture}}} \|_{\CC^{-\kappa}(\rho^\sigma)},
\end{equation}
\begin{align*}
&\|6\UU_>X\succ(X^{\!\resizebox{0.6em}{!}{
\begin{tikzpicture}
\pgfpathmoveto{\pgfqpoint{0cm}{-0.035cm}}
\pgfpathlineto{\pgfqpoint{1.376cm}{-0.035cm}}
\pgfpathlineto{\pgfqpoint{1.376cm}{1.552cm}}
\pgfpathlineto{\pgfqpoint{0cm}{1.552cm}}
\pgfpathclose
\pgfusepath{clip}
\begin{pgfscope}
\begin{pgfscope}
\pgfpathmoveto{\pgfqpoint{0cm}{-0.035cm}}
\pgfpathlineto{\pgfqpoint{1.376cm}{-0.035cm}}
\pgfpathlineto{\pgfqpoint{1.376cm}{1.552cm}}
\pgfpathlineto{\pgfqpoint{0cm}{1.552cm}}
\pgfpathclose
\pgfusepath{clip}
\begin{pgfscope}
\begin{pgfscope}
\pgfsetdash{}{0cm}
\pgfsetlinewidth{0.818mm}
\pgfsetroundcap
\pgfsetroundjoin
\pgfsetmiterlimit{7.0}
\definecolor{eps2pgf_color}{gray}{0}\pgfsetstrokecolor{eps2pgf_color}\pgfsetfillcolor{eps2pgf_color}
\pgfpathmoveto{\pgfqpoint{0.117cm}{1.421cm}}
\pgfpathlineto{\pgfqpoint{0.682cm}{0.671cm}}
\pgfpathlineto{\pgfqpoint{1.246cm}{1.421cm}}
\pgfusepath{stroke}
\end{pgfscope}
\definecolor{eps2pgf_color}{gray}{0}\pgfsetstrokecolor{eps2pgf_color}\pgfsetfillcolor{eps2pgf_color}
\pgfpathmoveto{\pgfqpoint{0.273cm}{1.395cm}}
\pgfpathcurveto{\pgfqpoint{0.273cm}{1.432cm}}{\pgfqpoint{0.259cm}{1.467cm}}{\pgfqpoint{0.233cm}{1.492cm}}
\pgfpathcurveto{\pgfqpoint{0.207cm}{1.518cm}}{\pgfqpoint{0.173cm}{1.532cm}}{\pgfqpoint{0.137cm}{1.532cm}}
\pgfpathcurveto{\pgfqpoint{0.1cm}{1.532cm}}{\pgfqpoint{0.066cm}{1.518cm}}{\pgfqpoint{0.04cm}{1.492cm}}
\pgfpathcurveto{\pgfqpoint{0.014cm}{1.467cm}}{\pgfqpoint{0cm}{1.432cm}}{\pgfqpoint{0cm}{1.395cm}}
\pgfpathcurveto{\pgfqpoint{0cm}{1.359cm}}{\pgfqpoint{0.014cm}{1.324cm}}{\pgfqpoint{0.04cm}{1.299cm}}
\pgfpathcurveto{\pgfqpoint{0.066cm}{1.273cm}}{\pgfqpoint{0.1cm}{1.258cm}}{\pgfqpoint{0.137cm}{1.258cm}}
\pgfpathcurveto{\pgfqpoint{0.173cm}{1.258cm}}{\pgfqpoint{0.207cm}{1.273cm}}{\pgfqpoint{0.233cm}{1.299cm}}
\pgfpathcurveto{\pgfqpoint{0.259cm}{1.324cm}}{\pgfqpoint{0.273cm}{1.359cm}}{\pgfqpoint{0.273cm}{1.395cm}}
\pgfusepath{fill}
\begin{pgfscope}
\pgfsetdash{}{0cm}
\pgfsetlinewidth{0.818mm}
\pgfsetmiterlimit{7.0}
\pgfpathmoveto{\pgfqpoint{0.682cm}{0.671cm}}
\pgfpathlineto{\pgfqpoint{0.679cm}{1.418cm}}
\pgfusepath{stroke}
\end{pgfscope}
\pgfpathmoveto{\pgfqpoint{0.815cm}{1.399cm}}
\pgfpathcurveto{\pgfqpoint{0.815cm}{1.435cm}}{\pgfqpoint{0.801cm}{1.47cm}}{\pgfqpoint{0.775cm}{1.496cm}}
\pgfpathcurveto{\pgfqpoint{0.75cm}{1.521cm}}{\pgfqpoint{0.715cm}{1.536cm}}{\pgfqpoint{0.679cm}{1.536cm}}
\pgfpathcurveto{\pgfqpoint{0.643cm}{1.536cm}}{\pgfqpoint{0.608cm}{1.521cm}}{\pgfqpoint{0.582cm}{1.496cm}}
\pgfpathcurveto{\pgfqpoint{0.557cm}{1.47cm}}{\pgfqpoint{0.542cm}{1.435cm}}{\pgfqpoint{0.542cm}{1.399cm}}
\pgfpathcurveto{\pgfqpoint{0.542cm}{1.363cm}}{\pgfqpoint{0.557cm}{1.328cm}}{\pgfqpoint{0.582cm}{1.302cm}}
\pgfpathcurveto{\pgfqpoint{0.608cm}{1.276cm}}{\pgfqpoint{0.643cm}{1.262cm}}{\pgfqpoint{0.679cm}{1.262cm}}
\pgfpathcurveto{\pgfqpoint{0.715cm}{1.262cm}}{\pgfqpoint{0.75cm}{1.276cm}}{\pgfqpoint{0.775cm}{1.302cm}}
\pgfpathcurveto{\pgfqpoint{0.801cm}{1.328cm}}{\pgfqpoint{0.815cm}{1.363cm}}{\pgfqpoint{0.815cm}{1.399cm}}
\pgfusepath{fill}
\pgfpathmoveto{\pgfqpoint{1.345cm}{1.371cm}}
\pgfpathcurveto{\pgfqpoint{1.345cm}{1.408cm}}{\pgfqpoint{1.331cm}{1.442cm}}{\pgfqpoint{1.305cm}{1.468cm}}
\pgfpathcurveto{\pgfqpoint{1.28cm}{1.494cm}}{\pgfqpoint{1.245cm}{1.508cm}}{\pgfqpoint{1.209cm}{1.508cm}}
\pgfpathcurveto{\pgfqpoint{1.172cm}{1.508cm}}{\pgfqpoint{1.138cm}{1.494cm}}{\pgfqpoint{1.112cm}{1.468cm}}
\pgfpathcurveto{\pgfqpoint{1.087cm}{1.442cm}}{\pgfqpoint{1.072cm}{1.408cm}}{\pgfqpoint{1.072cm}{1.371cm}}
\pgfpathcurveto{\pgfqpoint{1.072cm}{1.335cm}}{\pgfqpoint{1.087cm}{1.3cm}}{\pgfqpoint{1.112cm}{1.274cm}}
\pgfpathcurveto{\pgfqpoint{1.138cm}{1.249cm}}{\pgfqpoint{1.172cm}{1.234cm}}{\pgfqpoint{1.209cm}{1.234cm}}
\pgfpathcurveto{\pgfqpoint{1.245cm}{1.234cm}}{\pgfqpoint{1.28cm}{1.249cm}}{\pgfqpoint{1.305cm}{1.274cm}}
\pgfpathcurveto{\pgfqpoint{1.331cm}{1.3cm}}{\pgfqpoint{1.345cm}{1.335cm}}{\pgfqpoint{1.345cm}{1.371cm}}
\pgfusepath{fill}
\begin{pgfscope}
\pgfsetdash{}{0cm}
\pgfsetlinewidth{0.818mm}
\pgfsetroundcap
\pgfsetmiterlimit{4.0}
\pgfpathmoveto{\pgfqpoint{0.682cm}{0.671cm}}
\pgfpathlineto{\pgfqpoint{0.682cm}{0.042cm}}
\pgfusepath{stroke}
\end{pgfscope}
\end{pgfscope}
\end{pgfscope}
\end{pgfscope}
\end{tikzpicture}}}(\phi+\psi))\|_{\CC^{-2+\alpha}(\rho)}+\|6\UU_> X\prec(X^{\!\resizebox{0.6em}{!}{
\begin{tikzpicture}
\pgfpathmoveto{\pgfqpoint{0cm}{-0.035cm}}
\pgfpathlineto{\pgfqpoint{1.376cm}{-0.035cm}}
\pgfpathlineto{\pgfqpoint{1.376cm}{1.552cm}}
\pgfpathlineto{\pgfqpoint{0cm}{1.552cm}}
\pgfpathclose
\pgfusepath{clip}
\begin{pgfscope}
\begin{pgfscope}
\pgfpathmoveto{\pgfqpoint{0cm}{-0.035cm}}
\pgfpathlineto{\pgfqpoint{1.376cm}{-0.035cm}}
\pgfpathlineto{\pgfqpoint{1.376cm}{1.552cm}}
\pgfpathlineto{\pgfqpoint{0cm}{1.552cm}}
\pgfpathclose
\pgfusepath{clip}
\begin{pgfscope}
\begin{pgfscope}
\pgfsetdash{}{0cm}
\pgfsetlinewidth{0.818mm}
\pgfsetroundcap
\pgfsetroundjoin
\pgfsetmiterlimit{7.0}
\definecolor{eps2pgf_color}{gray}{0}\pgfsetstrokecolor{eps2pgf_color}\pgfsetfillcolor{eps2pgf_color}
\pgfpathmoveto{\pgfqpoint{0.117cm}{1.421cm}}
\pgfpathlineto{\pgfqpoint{0.682cm}{0.671cm}}
\pgfpathlineto{\pgfqpoint{1.246cm}{1.421cm}}
\pgfusepath{stroke}
\end{pgfscope}
\definecolor{eps2pgf_color}{gray}{0}\pgfsetstrokecolor{eps2pgf_color}\pgfsetfillcolor{eps2pgf_color}
\pgfpathmoveto{\pgfqpoint{0.273cm}{1.395cm}}
\pgfpathcurveto{\pgfqpoint{0.273cm}{1.432cm}}{\pgfqpoint{0.259cm}{1.467cm}}{\pgfqpoint{0.233cm}{1.492cm}}
\pgfpathcurveto{\pgfqpoint{0.207cm}{1.518cm}}{\pgfqpoint{0.173cm}{1.532cm}}{\pgfqpoint{0.137cm}{1.532cm}}
\pgfpathcurveto{\pgfqpoint{0.1cm}{1.532cm}}{\pgfqpoint{0.066cm}{1.518cm}}{\pgfqpoint{0.04cm}{1.492cm}}
\pgfpathcurveto{\pgfqpoint{0.014cm}{1.467cm}}{\pgfqpoint{0cm}{1.432cm}}{\pgfqpoint{0cm}{1.395cm}}
\pgfpathcurveto{\pgfqpoint{0cm}{1.359cm}}{\pgfqpoint{0.014cm}{1.324cm}}{\pgfqpoint{0.04cm}{1.299cm}}
\pgfpathcurveto{\pgfqpoint{0.066cm}{1.273cm}}{\pgfqpoint{0.1cm}{1.258cm}}{\pgfqpoint{0.137cm}{1.258cm}}
\pgfpathcurveto{\pgfqpoint{0.173cm}{1.258cm}}{\pgfqpoint{0.207cm}{1.273cm}}{\pgfqpoint{0.233cm}{1.299cm}}
\pgfpathcurveto{\pgfqpoint{0.259cm}{1.324cm}}{\pgfqpoint{0.273cm}{1.359cm}}{\pgfqpoint{0.273cm}{1.395cm}}
\pgfusepath{fill}
\begin{pgfscope}
\pgfsetdash{}{0cm}
\pgfsetlinewidth{0.818mm}
\pgfsetmiterlimit{7.0}
\pgfpathmoveto{\pgfqpoint{0.682cm}{0.671cm}}
\pgfpathlineto{\pgfqpoint{0.679cm}{1.418cm}}
\pgfusepath{stroke}
\end{pgfscope}
\pgfpathmoveto{\pgfqpoint{0.815cm}{1.399cm}}
\pgfpathcurveto{\pgfqpoint{0.815cm}{1.435cm}}{\pgfqpoint{0.801cm}{1.47cm}}{\pgfqpoint{0.775cm}{1.496cm}}
\pgfpathcurveto{\pgfqpoint{0.75cm}{1.521cm}}{\pgfqpoint{0.715cm}{1.536cm}}{\pgfqpoint{0.679cm}{1.536cm}}
\pgfpathcurveto{\pgfqpoint{0.643cm}{1.536cm}}{\pgfqpoint{0.608cm}{1.521cm}}{\pgfqpoint{0.582cm}{1.496cm}}
\pgfpathcurveto{\pgfqpoint{0.557cm}{1.47cm}}{\pgfqpoint{0.542cm}{1.435cm}}{\pgfqpoint{0.542cm}{1.399cm}}
\pgfpathcurveto{\pgfqpoint{0.542cm}{1.363cm}}{\pgfqpoint{0.557cm}{1.328cm}}{\pgfqpoint{0.582cm}{1.302cm}}
\pgfpathcurveto{\pgfqpoint{0.608cm}{1.276cm}}{\pgfqpoint{0.643cm}{1.262cm}}{\pgfqpoint{0.679cm}{1.262cm}}
\pgfpathcurveto{\pgfqpoint{0.715cm}{1.262cm}}{\pgfqpoint{0.75cm}{1.276cm}}{\pgfqpoint{0.775cm}{1.302cm}}
\pgfpathcurveto{\pgfqpoint{0.801cm}{1.328cm}}{\pgfqpoint{0.815cm}{1.363cm}}{\pgfqpoint{0.815cm}{1.399cm}}
\pgfusepath{fill}
\pgfpathmoveto{\pgfqpoint{1.345cm}{1.371cm}}
\pgfpathcurveto{\pgfqpoint{1.345cm}{1.408cm}}{\pgfqpoint{1.331cm}{1.442cm}}{\pgfqpoint{1.305cm}{1.468cm}}
\pgfpathcurveto{\pgfqpoint{1.28cm}{1.494cm}}{\pgfqpoint{1.245cm}{1.508cm}}{\pgfqpoint{1.209cm}{1.508cm}}
\pgfpathcurveto{\pgfqpoint{1.172cm}{1.508cm}}{\pgfqpoint{1.138cm}{1.494cm}}{\pgfqpoint{1.112cm}{1.468cm}}
\pgfpathcurveto{\pgfqpoint{1.087cm}{1.442cm}}{\pgfqpoint{1.072cm}{1.408cm}}{\pgfqpoint{1.072cm}{1.371cm}}
\pgfpathcurveto{\pgfqpoint{1.072cm}{1.335cm}}{\pgfqpoint{1.087cm}{1.3cm}}{\pgfqpoint{1.112cm}{1.274cm}}
\pgfpathcurveto{\pgfqpoint{1.138cm}{1.249cm}}{\pgfqpoint{1.172cm}{1.234cm}}{\pgfqpoint{1.209cm}{1.234cm}}
\pgfpathcurveto{\pgfqpoint{1.245cm}{1.234cm}}{\pgfqpoint{1.28cm}{1.249cm}}{\pgfqpoint{1.305cm}{1.274cm}}
\pgfpathcurveto{\pgfqpoint{1.331cm}{1.3cm}}{\pgfqpoint{1.345cm}{1.335cm}}{\pgfqpoint{1.345cm}{1.371cm}}
\pgfusepath{fill}
\begin{pgfscope}
\pgfsetdash{}{0cm}
\pgfsetlinewidth{0.818mm}
\pgfsetroundcap
\pgfsetmiterlimit{4.0}
\pgfpathmoveto{\pgfqpoint{0.682cm}{0.671cm}}
\pgfpathlineto{\pgfqpoint{0.682cm}{0.042cm}}
\pgfusepath{stroke}
\end{pgfscope}
\end{pgfscope}
\end{pgfscope}
\end{pgfscope}
\end{tikzpicture}}}(\phi+\psi))\|_{\CC^{-2+\alpha}(\rho)}\\
&\quad\lesssim\|\phi+\psi\|_{L^\infty(\rho)}\|\UU_>X\|_{\CC^{-2+\alpha}(\rho^{-\alpha})}\\
&\quad\lesssim 2^{-(\frac{3}{2}-\alpha-\kappa)\frac{2}{3}K}\|\phi+\psi\|_{L^\infty(\rho)},
\end{align*}
provided
\begin{equation}\label{eq:l3}
\|\UU_> X\|_{\CC^{-2+\alpha}(\rho^{-\alpha})}\lesssim 2^{-(\frac{3}{2}-\alpha-\kappa)\frac{2}{3}K} \|X\|_{\CC^{-\frac{1}{2}-\kappa}(\rho^\sigma)},
\end{equation}
\begin{align*}
\|6(\phi+\psi)\prec\UU_{>}X^{\!\resizebox{!}{.8em}{
\begin{tikzpicture}
\pgfpathmoveto{\pgfqpoint{0cm}{-0.035cm}}
\pgfpathlineto{\pgfqpoint{1.976cm}{-0.035cm}}
\pgfpathlineto{\pgfqpoint{1.976cm}{1.94cm}}
\pgfpathlineto{\pgfqpoint{0cm}{1.94cm}}
\pgfpathclose
\pgfusepath{clip}
\begin{pgfscope}
\begin{pgfscope}
\pgfpathmoveto{\pgfqpoint{0cm}{-0.035cm}}
\pgfpathlineto{\pgfqpoint{1.976cm}{-0.035cm}}
\pgfpathlineto{\pgfqpoint{1.976cm}{1.94cm}}
\pgfpathlineto{\pgfqpoint{0cm}{1.94cm}}
\pgfpathclose
\pgfusepath{clip}
\begin{pgfscope}
\begin{pgfscope}
\pgfsetdash{}{0cm}
\pgfsetlinewidth{0.818mm}
\pgfsetroundcap
\pgfsetroundjoin
\pgfsetmiterlimit{7.0}
\definecolor{eps2pgf_color}{gray}{0}\pgfsetstrokecolor{eps2pgf_color}\pgfsetfillcolor{eps2pgf_color}
\pgfpathmoveto{\pgfqpoint{0.117cm}{1.815cm}}
\pgfpathlineto{\pgfqpoint{0.682cm}{1.065cm}}
\pgfpathlineto{\pgfqpoint{1.246cm}{1.815cm}}
\pgfusepath{stroke}
\end{pgfscope}
\definecolor{eps2pgf_color}{gray}{0}\pgfsetstrokecolor{eps2pgf_color}\pgfsetfillcolor{eps2pgf_color}
\pgfpathmoveto{\pgfqpoint{0.273cm}{1.789cm}}
\pgfpathcurveto{\pgfqpoint{0.273cm}{1.825cm}}{\pgfqpoint{0.259cm}{1.86cm}}{\pgfqpoint{0.233cm}{1.886cm}}
\pgfpathcurveto{\pgfqpoint{0.207cm}{1.912cm}}{\pgfqpoint{0.173cm}{1.926cm}}{\pgfqpoint{0.137cm}{1.926cm}}
\pgfpathcurveto{\pgfqpoint{0.1cm}{1.926cm}}{\pgfqpoint{0.066cm}{1.912cm}}{\pgfqpoint{0.04cm}{1.886cm}}
\pgfpathcurveto{\pgfqpoint{0.014cm}{1.86cm}}{\pgfqpoint{0cm}{1.825cm}}{\pgfqpoint{0cm}{1.789cm}}
\pgfpathcurveto{\pgfqpoint{0cm}{1.753cm}}{\pgfqpoint{0.014cm}{1.718cm}}{\pgfqpoint{0.04cm}{1.692cm}}
\pgfpathcurveto{\pgfqpoint{0.066cm}{1.667cm}}{\pgfqpoint{0.1cm}{1.652cm}}{\pgfqpoint{0.137cm}{1.652cm}}
\pgfpathcurveto{\pgfqpoint{0.173cm}{1.652cm}}{\pgfqpoint{0.207cm}{1.667cm}}{\pgfqpoint{0.233cm}{1.692cm}}
\pgfpathcurveto{\pgfqpoint{0.259cm}{1.718cm}}{\pgfqpoint{0.273cm}{1.753cm}}{\pgfqpoint{0.273cm}{1.789cm}}
\pgfusepath{fill}
\begin{pgfscope}
\pgfsetdash{}{0cm}
\pgfsetlinewidth{0.818mm}
\pgfsetmiterlimit{7.0}
\pgfpathmoveto{\pgfqpoint{0.682cm}{1.065cm}}
\pgfpathlineto{\pgfqpoint{0.679cm}{1.812cm}}
\pgfusepath{stroke}
\end{pgfscope}
\pgfpathmoveto{\pgfqpoint{0.815cm}{1.793cm}}
\pgfpathcurveto{\pgfqpoint{0.815cm}{1.829cm}}{\pgfqpoint{0.801cm}{1.864cm}}{\pgfqpoint{0.775cm}{1.89cm}}
\pgfpathcurveto{\pgfqpoint{0.75cm}{1.915cm}}{\pgfqpoint{0.715cm}{1.93cm}}{\pgfqpoint{0.679cm}{1.93cm}}
\pgfpathcurveto{\pgfqpoint{0.643cm}{1.93cm}}{\pgfqpoint{0.608cm}{1.915cm}}{\pgfqpoint{0.582cm}{1.89cm}}
\pgfpathcurveto{\pgfqpoint{0.557cm}{1.864cm}}{\pgfqpoint{0.542cm}{1.829cm}}{\pgfqpoint{0.542cm}{1.793cm}}
\pgfpathcurveto{\pgfqpoint{0.542cm}{1.756cm}}{\pgfqpoint{0.557cm}{1.722cm}}{\pgfqpoint{0.582cm}{1.696cm}}
\pgfpathcurveto{\pgfqpoint{0.608cm}{1.67cm}}{\pgfqpoint{0.643cm}{1.656cm}}{\pgfqpoint{0.679cm}{1.656cm}}
\pgfpathcurveto{\pgfqpoint{0.715cm}{1.656cm}}{\pgfqpoint{0.75cm}{1.67cm}}{\pgfqpoint{0.775cm}{1.696cm}}
\pgfpathcurveto{\pgfqpoint{0.801cm}{1.722cm}}{\pgfqpoint{0.815cm}{1.756cm}}{\pgfqpoint{0.815cm}{1.793cm}}
\pgfusepath{fill}
\pgfpathmoveto{\pgfqpoint{1.345cm}{1.765cm}}
\pgfpathcurveto{\pgfqpoint{1.345cm}{1.801cm}}{\pgfqpoint{1.331cm}{1.836cm}}{\pgfqpoint{1.305cm}{1.862cm}}
\pgfpathcurveto{\pgfqpoint{1.28cm}{1.887cm}}{\pgfqpoint{1.245cm}{1.902cm}}{\pgfqpoint{1.209cm}{1.902cm}}
\pgfpathcurveto{\pgfqpoint{1.172cm}{1.902cm}}{\pgfqpoint{1.138cm}{1.887cm}}{\pgfqpoint{1.112cm}{1.862cm}}
\pgfpathcurveto{\pgfqpoint{1.087cm}{1.836cm}}{\pgfqpoint{1.072cm}{1.801cm}}{\pgfqpoint{1.072cm}{1.765cm}}
\pgfpathcurveto{\pgfqpoint{1.072cm}{1.728cm}}{\pgfqpoint{1.087cm}{1.694cm}}{\pgfqpoint{1.112cm}{1.668cm}}
\pgfpathcurveto{\pgfqpoint{1.138cm}{1.642cm}}{\pgfqpoint{1.172cm}{1.628cm}}{\pgfqpoint{1.209cm}{1.628cm}}
\pgfpathcurveto{\pgfqpoint{1.245cm}{1.628cm}}{\pgfqpoint{1.28cm}{1.642cm}}{\pgfqpoint{1.305cm}{1.668cm}}
\pgfpathcurveto{\pgfqpoint{1.331cm}{1.694cm}}{\pgfqpoint{1.345cm}{1.728cm}}{\pgfqpoint{1.345cm}{1.765cm}}
\pgfusepath{fill}
\begin{pgfscope}
\pgfsetdash{}{0cm}
\pgfsetlinewidth{0.818mm}
\pgfsetroundcap
\pgfsetroundjoin
\pgfsetmiterlimit{7.0}
\pgfpathmoveto{\pgfqpoint{0.682cm}{1.065cm}}
\pgfpathlineto{\pgfqpoint{1.246cm}{0.315cm}}
\pgfpathlineto{\pgfqpoint{1.811cm}{1.065cm}}
\pgfusepath{stroke}
\end{pgfscope}
\pgfpathmoveto{\pgfqpoint{1.948cm}{1.065cm}}
\pgfpathcurveto{\pgfqpoint{1.948cm}{1.101cm}}{\pgfqpoint{1.933cm}{1.136cm}}{\pgfqpoint{1.907cm}{1.162cm}}
\pgfpathcurveto{\pgfqpoint{1.882cm}{1.187cm}}{\pgfqpoint{1.847cm}{1.202cm}}{\pgfqpoint{1.811cm}{1.202cm}}
\pgfpathcurveto{\pgfqpoint{1.775cm}{1.202cm}}{\pgfqpoint{1.74cm}{1.187cm}}{\pgfqpoint{1.714cm}{1.162cm}}
\pgfpathcurveto{\pgfqpoint{1.689cm}{1.136cm}}{\pgfqpoint{1.674cm}{1.101cm}}{\pgfqpoint{1.674cm}{1.065cm}}
\pgfpathcurveto{\pgfqpoint{1.674cm}{1.029cm}}{\pgfqpoint{1.689cm}{0.994cm}}{\pgfqpoint{1.714cm}{0.968cm}}
\pgfpathcurveto{\pgfqpoint{1.74cm}{0.942cm}}{\pgfqpoint{1.775cm}{0.928cm}}{\pgfqpoint{1.811cm}{0.928cm}}
\pgfpathcurveto{\pgfqpoint{1.847cm}{0.928cm}}{\pgfqpoint{1.882cm}{0.942cm}}{\pgfqpoint{1.907cm}{0.968cm}}
\pgfpathcurveto{\pgfqpoint{1.933cm}{0.994cm}}{\pgfqpoint{1.948cm}{1.029cm}}{\pgfqpoint{1.948cm}{1.065cm}}
\pgfusepath{fill}
\begin{pgfscope}
\pgfsetdash{}{0cm}
\pgfsetlinewidth{0.818mm}
\pgfsetmiterlimit{4.0}
\pgfpathmoveto{\pgfqpoint{1.383cm}{0.178cm}}
\pgfpathcurveto{\pgfqpoint{1.383cm}{0.214cm}}{\pgfqpoint{1.369cm}{0.249cm}}{\pgfqpoint{1.343cm}{0.275cm}}
\pgfpathcurveto{\pgfqpoint{1.317cm}{0.3cm}}{\pgfqpoint{1.283cm}{0.315cm}}{\pgfqpoint{1.246cm}{0.315cm}}
\pgfpathcurveto{\pgfqpoint{1.21cm}{0.315cm}}{\pgfqpoint{1.175cm}{0.3cm}}{\pgfqpoint{1.15cm}{0.275cm}}
\pgfpathcurveto{\pgfqpoint{1.124cm}{0.249cm}}{\pgfqpoint{1.11cm}{0.214cm}}{\pgfqpoint{1.11cm}{0.178cm}}
\pgfpathcurveto{\pgfqpoint{1.11cm}{0.141cm}}{\pgfqpoint{1.124cm}{0.107cm}}{\pgfqpoint{1.15cm}{0.081cm}}
\pgfpathcurveto{\pgfqpoint{1.175cm}{0.055cm}}{\pgfqpoint{1.21cm}{0.041cm}}{\pgfqpoint{1.246cm}{0.041cm}}
\pgfpathcurveto{\pgfqpoint{1.283cm}{0.041cm}}{\pgfqpoint{1.317cm}{0.055cm}}{\pgfqpoint{1.343cm}{0.081cm}}
\pgfpathcurveto{\pgfqpoint{1.369cm}{0.107cm}}{\pgfqpoint{1.383cm}{0.141cm}}{\pgfqpoint{1.383cm}{0.178cm}}
\pgfusepath{stroke}
\end{pgfscope}
\end{pgfscope}
\end{pgfscope}
\end{pgfscope}
\end{tikzpicture}}}\|_{\CC^{-2+\alpha}(\rho)}
&\lesssim \|\phi+\psi\|_{L^\infty(\rho)}\|\UU_>X^{\!\resizebox{!}{.8em}{
\begin{tikzpicture}
\pgfpathmoveto{\pgfqpoint{0cm}{-0.035cm}}
\pgfpathlineto{\pgfqpoint{1.976cm}{-0.035cm}}
\pgfpathlineto{\pgfqpoint{1.976cm}{1.94cm}}
\pgfpathlineto{\pgfqpoint{0cm}{1.94cm}}
\pgfpathclose
\pgfusepath{clip}
\begin{pgfscope}
\begin{pgfscope}
\pgfpathmoveto{\pgfqpoint{0cm}{-0.035cm}}
\pgfpathlineto{\pgfqpoint{1.976cm}{-0.035cm}}
\pgfpathlineto{\pgfqpoint{1.976cm}{1.94cm}}
\pgfpathlineto{\pgfqpoint{0cm}{1.94cm}}
\pgfpathclose
\pgfusepath{clip}
\begin{pgfscope}
\begin{pgfscope}
\pgfsetdash{}{0cm}
\pgfsetlinewidth{0.818mm}
\pgfsetroundcap
\pgfsetroundjoin
\pgfsetmiterlimit{7.0}
\definecolor{eps2pgf_color}{gray}{0}\pgfsetstrokecolor{eps2pgf_color}\pgfsetfillcolor{eps2pgf_color}
\pgfpathmoveto{\pgfqpoint{0.117cm}{1.815cm}}
\pgfpathlineto{\pgfqpoint{0.682cm}{1.065cm}}
\pgfpathlineto{\pgfqpoint{1.246cm}{1.815cm}}
\pgfusepath{stroke}
\end{pgfscope}
\definecolor{eps2pgf_color}{gray}{0}\pgfsetstrokecolor{eps2pgf_color}\pgfsetfillcolor{eps2pgf_color}
\pgfpathmoveto{\pgfqpoint{0.273cm}{1.789cm}}
\pgfpathcurveto{\pgfqpoint{0.273cm}{1.825cm}}{\pgfqpoint{0.259cm}{1.86cm}}{\pgfqpoint{0.233cm}{1.886cm}}
\pgfpathcurveto{\pgfqpoint{0.207cm}{1.912cm}}{\pgfqpoint{0.173cm}{1.926cm}}{\pgfqpoint{0.137cm}{1.926cm}}
\pgfpathcurveto{\pgfqpoint{0.1cm}{1.926cm}}{\pgfqpoint{0.066cm}{1.912cm}}{\pgfqpoint{0.04cm}{1.886cm}}
\pgfpathcurveto{\pgfqpoint{0.014cm}{1.86cm}}{\pgfqpoint{0cm}{1.825cm}}{\pgfqpoint{0cm}{1.789cm}}
\pgfpathcurveto{\pgfqpoint{0cm}{1.753cm}}{\pgfqpoint{0.014cm}{1.718cm}}{\pgfqpoint{0.04cm}{1.692cm}}
\pgfpathcurveto{\pgfqpoint{0.066cm}{1.667cm}}{\pgfqpoint{0.1cm}{1.652cm}}{\pgfqpoint{0.137cm}{1.652cm}}
\pgfpathcurveto{\pgfqpoint{0.173cm}{1.652cm}}{\pgfqpoint{0.207cm}{1.667cm}}{\pgfqpoint{0.233cm}{1.692cm}}
\pgfpathcurveto{\pgfqpoint{0.259cm}{1.718cm}}{\pgfqpoint{0.273cm}{1.753cm}}{\pgfqpoint{0.273cm}{1.789cm}}
\pgfusepath{fill}
\begin{pgfscope}
\pgfsetdash{}{0cm}
\pgfsetlinewidth{0.818mm}
\pgfsetmiterlimit{7.0}
\pgfpathmoveto{\pgfqpoint{0.682cm}{1.065cm}}
\pgfpathlineto{\pgfqpoint{0.679cm}{1.812cm}}
\pgfusepath{stroke}
\end{pgfscope}
\pgfpathmoveto{\pgfqpoint{0.815cm}{1.793cm}}
\pgfpathcurveto{\pgfqpoint{0.815cm}{1.829cm}}{\pgfqpoint{0.801cm}{1.864cm}}{\pgfqpoint{0.775cm}{1.89cm}}
\pgfpathcurveto{\pgfqpoint{0.75cm}{1.915cm}}{\pgfqpoint{0.715cm}{1.93cm}}{\pgfqpoint{0.679cm}{1.93cm}}
\pgfpathcurveto{\pgfqpoint{0.643cm}{1.93cm}}{\pgfqpoint{0.608cm}{1.915cm}}{\pgfqpoint{0.582cm}{1.89cm}}
\pgfpathcurveto{\pgfqpoint{0.557cm}{1.864cm}}{\pgfqpoint{0.542cm}{1.829cm}}{\pgfqpoint{0.542cm}{1.793cm}}
\pgfpathcurveto{\pgfqpoint{0.542cm}{1.756cm}}{\pgfqpoint{0.557cm}{1.722cm}}{\pgfqpoint{0.582cm}{1.696cm}}
\pgfpathcurveto{\pgfqpoint{0.608cm}{1.67cm}}{\pgfqpoint{0.643cm}{1.656cm}}{\pgfqpoint{0.679cm}{1.656cm}}
\pgfpathcurveto{\pgfqpoint{0.715cm}{1.656cm}}{\pgfqpoint{0.75cm}{1.67cm}}{\pgfqpoint{0.775cm}{1.696cm}}
\pgfpathcurveto{\pgfqpoint{0.801cm}{1.722cm}}{\pgfqpoint{0.815cm}{1.756cm}}{\pgfqpoint{0.815cm}{1.793cm}}
\pgfusepath{fill}
\pgfpathmoveto{\pgfqpoint{1.345cm}{1.765cm}}
\pgfpathcurveto{\pgfqpoint{1.345cm}{1.801cm}}{\pgfqpoint{1.331cm}{1.836cm}}{\pgfqpoint{1.305cm}{1.862cm}}
\pgfpathcurveto{\pgfqpoint{1.28cm}{1.887cm}}{\pgfqpoint{1.245cm}{1.902cm}}{\pgfqpoint{1.209cm}{1.902cm}}
\pgfpathcurveto{\pgfqpoint{1.172cm}{1.902cm}}{\pgfqpoint{1.138cm}{1.887cm}}{\pgfqpoint{1.112cm}{1.862cm}}
\pgfpathcurveto{\pgfqpoint{1.087cm}{1.836cm}}{\pgfqpoint{1.072cm}{1.801cm}}{\pgfqpoint{1.072cm}{1.765cm}}
\pgfpathcurveto{\pgfqpoint{1.072cm}{1.728cm}}{\pgfqpoint{1.087cm}{1.694cm}}{\pgfqpoint{1.112cm}{1.668cm}}
\pgfpathcurveto{\pgfqpoint{1.138cm}{1.642cm}}{\pgfqpoint{1.172cm}{1.628cm}}{\pgfqpoint{1.209cm}{1.628cm}}
\pgfpathcurveto{\pgfqpoint{1.245cm}{1.628cm}}{\pgfqpoint{1.28cm}{1.642cm}}{\pgfqpoint{1.305cm}{1.668cm}}
\pgfpathcurveto{\pgfqpoint{1.331cm}{1.694cm}}{\pgfqpoint{1.345cm}{1.728cm}}{\pgfqpoint{1.345cm}{1.765cm}}
\pgfusepath{fill}
\begin{pgfscope}
\pgfsetdash{}{0cm}
\pgfsetlinewidth{0.818mm}
\pgfsetroundcap
\pgfsetroundjoin
\pgfsetmiterlimit{7.0}
\pgfpathmoveto{\pgfqpoint{0.682cm}{1.065cm}}
\pgfpathlineto{\pgfqpoint{1.246cm}{0.315cm}}
\pgfpathlineto{\pgfqpoint{1.811cm}{1.065cm}}
\pgfusepath{stroke}
\end{pgfscope}
\pgfpathmoveto{\pgfqpoint{1.948cm}{1.065cm}}
\pgfpathcurveto{\pgfqpoint{1.948cm}{1.101cm}}{\pgfqpoint{1.933cm}{1.136cm}}{\pgfqpoint{1.907cm}{1.162cm}}
\pgfpathcurveto{\pgfqpoint{1.882cm}{1.187cm}}{\pgfqpoint{1.847cm}{1.202cm}}{\pgfqpoint{1.811cm}{1.202cm}}
\pgfpathcurveto{\pgfqpoint{1.775cm}{1.202cm}}{\pgfqpoint{1.74cm}{1.187cm}}{\pgfqpoint{1.714cm}{1.162cm}}
\pgfpathcurveto{\pgfqpoint{1.689cm}{1.136cm}}{\pgfqpoint{1.674cm}{1.101cm}}{\pgfqpoint{1.674cm}{1.065cm}}
\pgfpathcurveto{\pgfqpoint{1.674cm}{1.029cm}}{\pgfqpoint{1.689cm}{0.994cm}}{\pgfqpoint{1.714cm}{0.968cm}}
\pgfpathcurveto{\pgfqpoint{1.74cm}{0.942cm}}{\pgfqpoint{1.775cm}{0.928cm}}{\pgfqpoint{1.811cm}{0.928cm}}
\pgfpathcurveto{\pgfqpoint{1.847cm}{0.928cm}}{\pgfqpoint{1.882cm}{0.942cm}}{\pgfqpoint{1.907cm}{0.968cm}}
\pgfpathcurveto{\pgfqpoint{1.933cm}{0.994cm}}{\pgfqpoint{1.948cm}{1.029cm}}{\pgfqpoint{1.948cm}{1.065cm}}
\pgfusepath{fill}
\begin{pgfscope}
\pgfsetdash{}{0cm}
\pgfsetlinewidth{0.818mm}
\pgfsetmiterlimit{4.0}
\pgfpathmoveto{\pgfqpoint{1.383cm}{0.178cm}}
\pgfpathcurveto{\pgfqpoint{1.383cm}{0.214cm}}{\pgfqpoint{1.369cm}{0.249cm}}{\pgfqpoint{1.343cm}{0.275cm}}
\pgfpathcurveto{\pgfqpoint{1.317cm}{0.3cm}}{\pgfqpoint{1.283cm}{0.315cm}}{\pgfqpoint{1.246cm}{0.315cm}}
\pgfpathcurveto{\pgfqpoint{1.21cm}{0.315cm}}{\pgfqpoint{1.175cm}{0.3cm}}{\pgfqpoint{1.15cm}{0.275cm}}
\pgfpathcurveto{\pgfqpoint{1.124cm}{0.249cm}}{\pgfqpoint{1.11cm}{0.214cm}}{\pgfqpoint{1.11cm}{0.178cm}}
\pgfpathcurveto{\pgfqpoint{1.11cm}{0.141cm}}{\pgfqpoint{1.124cm}{0.107cm}}{\pgfqpoint{1.15cm}{0.081cm}}
\pgfpathcurveto{\pgfqpoint{1.175cm}{0.055cm}}{\pgfqpoint{1.21cm}{0.041cm}}{\pgfqpoint{1.246cm}{0.041cm}}
\pgfpathcurveto{\pgfqpoint{1.283cm}{0.041cm}}{\pgfqpoint{1.317cm}{0.055cm}}{\pgfqpoint{1.343cm}{0.081cm}}
\pgfpathcurveto{\pgfqpoint{1.369cm}{0.107cm}}{\pgfqpoint{1.383cm}{0.141cm}}{\pgfqpoint{1.383cm}{0.178cm}}
\pgfusepath{stroke}
\end{pgfscope}
\end{pgfscope}
\end{pgfscope}
\end{pgfscope}
\end{tikzpicture}}}\|_{\CC^{-2+\alpha}}\\
&\lesssim 2^{-(2-\alpha-\kappa)K/2}\|\phi+\psi\|_{L^\infty(\rho)},
\end{align*}
provided
\begin{equation}\label{eq:l4}
\|\UU_> X^{\!\resizebox{!}{.8em}{
\begin{tikzpicture}
\pgfpathmoveto{\pgfqpoint{0cm}{-0.035cm}}
\pgfpathlineto{\pgfqpoint{1.976cm}{-0.035cm}}
\pgfpathlineto{\pgfqpoint{1.976cm}{1.94cm}}
\pgfpathlineto{\pgfqpoint{0cm}{1.94cm}}
\pgfpathclose
\pgfusepath{clip}
\begin{pgfscope}
\begin{pgfscope}
\pgfpathmoveto{\pgfqpoint{0cm}{-0.035cm}}
\pgfpathlineto{\pgfqpoint{1.976cm}{-0.035cm}}
\pgfpathlineto{\pgfqpoint{1.976cm}{1.94cm}}
\pgfpathlineto{\pgfqpoint{0cm}{1.94cm}}
\pgfpathclose
\pgfusepath{clip}
\begin{pgfscope}
\begin{pgfscope}
\pgfsetdash{}{0cm}
\pgfsetlinewidth{0.818mm}
\pgfsetroundcap
\pgfsetroundjoin
\pgfsetmiterlimit{7.0}
\definecolor{eps2pgf_color}{gray}{0}\pgfsetstrokecolor{eps2pgf_color}\pgfsetfillcolor{eps2pgf_color}
\pgfpathmoveto{\pgfqpoint{0.117cm}{1.815cm}}
\pgfpathlineto{\pgfqpoint{0.682cm}{1.065cm}}
\pgfpathlineto{\pgfqpoint{1.246cm}{1.815cm}}
\pgfusepath{stroke}
\end{pgfscope}
\definecolor{eps2pgf_color}{gray}{0}\pgfsetstrokecolor{eps2pgf_color}\pgfsetfillcolor{eps2pgf_color}
\pgfpathmoveto{\pgfqpoint{0.273cm}{1.789cm}}
\pgfpathcurveto{\pgfqpoint{0.273cm}{1.825cm}}{\pgfqpoint{0.259cm}{1.86cm}}{\pgfqpoint{0.233cm}{1.886cm}}
\pgfpathcurveto{\pgfqpoint{0.207cm}{1.912cm}}{\pgfqpoint{0.173cm}{1.926cm}}{\pgfqpoint{0.137cm}{1.926cm}}
\pgfpathcurveto{\pgfqpoint{0.1cm}{1.926cm}}{\pgfqpoint{0.066cm}{1.912cm}}{\pgfqpoint{0.04cm}{1.886cm}}
\pgfpathcurveto{\pgfqpoint{0.014cm}{1.86cm}}{\pgfqpoint{0cm}{1.825cm}}{\pgfqpoint{0cm}{1.789cm}}
\pgfpathcurveto{\pgfqpoint{0cm}{1.753cm}}{\pgfqpoint{0.014cm}{1.718cm}}{\pgfqpoint{0.04cm}{1.692cm}}
\pgfpathcurveto{\pgfqpoint{0.066cm}{1.667cm}}{\pgfqpoint{0.1cm}{1.652cm}}{\pgfqpoint{0.137cm}{1.652cm}}
\pgfpathcurveto{\pgfqpoint{0.173cm}{1.652cm}}{\pgfqpoint{0.207cm}{1.667cm}}{\pgfqpoint{0.233cm}{1.692cm}}
\pgfpathcurveto{\pgfqpoint{0.259cm}{1.718cm}}{\pgfqpoint{0.273cm}{1.753cm}}{\pgfqpoint{0.273cm}{1.789cm}}
\pgfusepath{fill}
\begin{pgfscope}
\pgfsetdash{}{0cm}
\pgfsetlinewidth{0.818mm}
\pgfsetmiterlimit{7.0}
\pgfpathmoveto{\pgfqpoint{0.682cm}{1.065cm}}
\pgfpathlineto{\pgfqpoint{0.679cm}{1.812cm}}
\pgfusepath{stroke}
\end{pgfscope}
\pgfpathmoveto{\pgfqpoint{0.815cm}{1.793cm}}
\pgfpathcurveto{\pgfqpoint{0.815cm}{1.829cm}}{\pgfqpoint{0.801cm}{1.864cm}}{\pgfqpoint{0.775cm}{1.89cm}}
\pgfpathcurveto{\pgfqpoint{0.75cm}{1.915cm}}{\pgfqpoint{0.715cm}{1.93cm}}{\pgfqpoint{0.679cm}{1.93cm}}
\pgfpathcurveto{\pgfqpoint{0.643cm}{1.93cm}}{\pgfqpoint{0.608cm}{1.915cm}}{\pgfqpoint{0.582cm}{1.89cm}}
\pgfpathcurveto{\pgfqpoint{0.557cm}{1.864cm}}{\pgfqpoint{0.542cm}{1.829cm}}{\pgfqpoint{0.542cm}{1.793cm}}
\pgfpathcurveto{\pgfqpoint{0.542cm}{1.756cm}}{\pgfqpoint{0.557cm}{1.722cm}}{\pgfqpoint{0.582cm}{1.696cm}}
\pgfpathcurveto{\pgfqpoint{0.608cm}{1.67cm}}{\pgfqpoint{0.643cm}{1.656cm}}{\pgfqpoint{0.679cm}{1.656cm}}
\pgfpathcurveto{\pgfqpoint{0.715cm}{1.656cm}}{\pgfqpoint{0.75cm}{1.67cm}}{\pgfqpoint{0.775cm}{1.696cm}}
\pgfpathcurveto{\pgfqpoint{0.801cm}{1.722cm}}{\pgfqpoint{0.815cm}{1.756cm}}{\pgfqpoint{0.815cm}{1.793cm}}
\pgfusepath{fill}
\pgfpathmoveto{\pgfqpoint{1.345cm}{1.765cm}}
\pgfpathcurveto{\pgfqpoint{1.345cm}{1.801cm}}{\pgfqpoint{1.331cm}{1.836cm}}{\pgfqpoint{1.305cm}{1.862cm}}
\pgfpathcurveto{\pgfqpoint{1.28cm}{1.887cm}}{\pgfqpoint{1.245cm}{1.902cm}}{\pgfqpoint{1.209cm}{1.902cm}}
\pgfpathcurveto{\pgfqpoint{1.172cm}{1.902cm}}{\pgfqpoint{1.138cm}{1.887cm}}{\pgfqpoint{1.112cm}{1.862cm}}
\pgfpathcurveto{\pgfqpoint{1.087cm}{1.836cm}}{\pgfqpoint{1.072cm}{1.801cm}}{\pgfqpoint{1.072cm}{1.765cm}}
\pgfpathcurveto{\pgfqpoint{1.072cm}{1.728cm}}{\pgfqpoint{1.087cm}{1.694cm}}{\pgfqpoint{1.112cm}{1.668cm}}
\pgfpathcurveto{\pgfqpoint{1.138cm}{1.642cm}}{\pgfqpoint{1.172cm}{1.628cm}}{\pgfqpoint{1.209cm}{1.628cm}}
\pgfpathcurveto{\pgfqpoint{1.245cm}{1.628cm}}{\pgfqpoint{1.28cm}{1.642cm}}{\pgfqpoint{1.305cm}{1.668cm}}
\pgfpathcurveto{\pgfqpoint{1.331cm}{1.694cm}}{\pgfqpoint{1.345cm}{1.728cm}}{\pgfqpoint{1.345cm}{1.765cm}}
\pgfusepath{fill}
\begin{pgfscope}
\pgfsetdash{}{0cm}
\pgfsetlinewidth{0.818mm}
\pgfsetroundcap
\pgfsetroundjoin
\pgfsetmiterlimit{7.0}
\pgfpathmoveto{\pgfqpoint{0.682cm}{1.065cm}}
\pgfpathlineto{\pgfqpoint{1.246cm}{0.315cm}}
\pgfpathlineto{\pgfqpoint{1.811cm}{1.065cm}}
\pgfusepath{stroke}
\end{pgfscope}
\pgfpathmoveto{\pgfqpoint{1.948cm}{1.065cm}}
\pgfpathcurveto{\pgfqpoint{1.948cm}{1.101cm}}{\pgfqpoint{1.933cm}{1.136cm}}{\pgfqpoint{1.907cm}{1.162cm}}
\pgfpathcurveto{\pgfqpoint{1.882cm}{1.187cm}}{\pgfqpoint{1.847cm}{1.202cm}}{\pgfqpoint{1.811cm}{1.202cm}}
\pgfpathcurveto{\pgfqpoint{1.775cm}{1.202cm}}{\pgfqpoint{1.74cm}{1.187cm}}{\pgfqpoint{1.714cm}{1.162cm}}
\pgfpathcurveto{\pgfqpoint{1.689cm}{1.136cm}}{\pgfqpoint{1.674cm}{1.101cm}}{\pgfqpoint{1.674cm}{1.065cm}}
\pgfpathcurveto{\pgfqpoint{1.674cm}{1.029cm}}{\pgfqpoint{1.689cm}{0.994cm}}{\pgfqpoint{1.714cm}{0.968cm}}
\pgfpathcurveto{\pgfqpoint{1.74cm}{0.942cm}}{\pgfqpoint{1.775cm}{0.928cm}}{\pgfqpoint{1.811cm}{0.928cm}}
\pgfpathcurveto{\pgfqpoint{1.847cm}{0.928cm}}{\pgfqpoint{1.882cm}{0.942cm}}{\pgfqpoint{1.907cm}{0.968cm}}
\pgfpathcurveto{\pgfqpoint{1.933cm}{0.994cm}}{\pgfqpoint{1.948cm}{1.029cm}}{\pgfqpoint{1.948cm}{1.065cm}}
\pgfusepath{fill}
\begin{pgfscope}
\pgfsetdash{}{0cm}
\pgfsetlinewidth{0.818mm}
\pgfsetmiterlimit{4.0}
\pgfpathmoveto{\pgfqpoint{1.383cm}{0.178cm}}
\pgfpathcurveto{\pgfqpoint{1.383cm}{0.214cm}}{\pgfqpoint{1.369cm}{0.249cm}}{\pgfqpoint{1.343cm}{0.275cm}}
\pgfpathcurveto{\pgfqpoint{1.317cm}{0.3cm}}{\pgfqpoint{1.283cm}{0.315cm}}{\pgfqpoint{1.246cm}{0.315cm}}
\pgfpathcurveto{\pgfqpoint{1.21cm}{0.315cm}}{\pgfqpoint{1.175cm}{0.3cm}}{\pgfqpoint{1.15cm}{0.275cm}}
\pgfpathcurveto{\pgfqpoint{1.124cm}{0.249cm}}{\pgfqpoint{1.11cm}{0.214cm}}{\pgfqpoint{1.11cm}{0.178cm}}
\pgfpathcurveto{\pgfqpoint{1.11cm}{0.141cm}}{\pgfqpoint{1.124cm}{0.107cm}}{\pgfqpoint{1.15cm}{0.081cm}}
\pgfpathcurveto{\pgfqpoint{1.175cm}{0.055cm}}{\pgfqpoint{1.21cm}{0.041cm}}{\pgfqpoint{1.246cm}{0.041cm}}
\pgfpathcurveto{\pgfqpoint{1.283cm}{0.041cm}}{\pgfqpoint{1.317cm}{0.055cm}}{\pgfqpoint{1.343cm}{0.081cm}}
\pgfpathcurveto{\pgfqpoint{1.369cm}{0.107cm}}{\pgfqpoint{1.383cm}{0.141cm}}{\pgfqpoint{1.383cm}{0.178cm}}
\pgfusepath{stroke}
\end{pgfscope}
\end{pgfscope}
\end{pgfscope}
\end{pgfscope}
\end{tikzpicture}}}\|_{\CC^{-2+\alpha}}\lesssim 2^{-(2-\alpha-\kappa)K/2} \|X^{\!\resizebox{!}{.8em}{
\begin{tikzpicture}
\pgfpathmoveto{\pgfqpoint{0cm}{-0.035cm}}
\pgfpathlineto{\pgfqpoint{1.976cm}{-0.035cm}}
\pgfpathlineto{\pgfqpoint{1.976cm}{1.94cm}}
\pgfpathlineto{\pgfqpoint{0cm}{1.94cm}}
\pgfpathclose
\pgfusepath{clip}
\begin{pgfscope}
\begin{pgfscope}
\pgfpathmoveto{\pgfqpoint{0cm}{-0.035cm}}
\pgfpathlineto{\pgfqpoint{1.976cm}{-0.035cm}}
\pgfpathlineto{\pgfqpoint{1.976cm}{1.94cm}}
\pgfpathlineto{\pgfqpoint{0cm}{1.94cm}}
\pgfpathclose
\pgfusepath{clip}
\begin{pgfscope}
\begin{pgfscope}
\pgfsetdash{}{0cm}
\pgfsetlinewidth{0.818mm}
\pgfsetroundcap
\pgfsetroundjoin
\pgfsetmiterlimit{7.0}
\definecolor{eps2pgf_color}{gray}{0}\pgfsetstrokecolor{eps2pgf_color}\pgfsetfillcolor{eps2pgf_color}
\pgfpathmoveto{\pgfqpoint{0.117cm}{1.815cm}}
\pgfpathlineto{\pgfqpoint{0.682cm}{1.065cm}}
\pgfpathlineto{\pgfqpoint{1.246cm}{1.815cm}}
\pgfusepath{stroke}
\end{pgfscope}
\definecolor{eps2pgf_color}{gray}{0}\pgfsetstrokecolor{eps2pgf_color}\pgfsetfillcolor{eps2pgf_color}
\pgfpathmoveto{\pgfqpoint{0.273cm}{1.789cm}}
\pgfpathcurveto{\pgfqpoint{0.273cm}{1.825cm}}{\pgfqpoint{0.259cm}{1.86cm}}{\pgfqpoint{0.233cm}{1.886cm}}
\pgfpathcurveto{\pgfqpoint{0.207cm}{1.912cm}}{\pgfqpoint{0.173cm}{1.926cm}}{\pgfqpoint{0.137cm}{1.926cm}}
\pgfpathcurveto{\pgfqpoint{0.1cm}{1.926cm}}{\pgfqpoint{0.066cm}{1.912cm}}{\pgfqpoint{0.04cm}{1.886cm}}
\pgfpathcurveto{\pgfqpoint{0.014cm}{1.86cm}}{\pgfqpoint{0cm}{1.825cm}}{\pgfqpoint{0cm}{1.789cm}}
\pgfpathcurveto{\pgfqpoint{0cm}{1.753cm}}{\pgfqpoint{0.014cm}{1.718cm}}{\pgfqpoint{0.04cm}{1.692cm}}
\pgfpathcurveto{\pgfqpoint{0.066cm}{1.667cm}}{\pgfqpoint{0.1cm}{1.652cm}}{\pgfqpoint{0.137cm}{1.652cm}}
\pgfpathcurveto{\pgfqpoint{0.173cm}{1.652cm}}{\pgfqpoint{0.207cm}{1.667cm}}{\pgfqpoint{0.233cm}{1.692cm}}
\pgfpathcurveto{\pgfqpoint{0.259cm}{1.718cm}}{\pgfqpoint{0.273cm}{1.753cm}}{\pgfqpoint{0.273cm}{1.789cm}}
\pgfusepath{fill}
\begin{pgfscope}
\pgfsetdash{}{0cm}
\pgfsetlinewidth{0.818mm}
\pgfsetmiterlimit{7.0}
\pgfpathmoveto{\pgfqpoint{0.682cm}{1.065cm}}
\pgfpathlineto{\pgfqpoint{0.679cm}{1.812cm}}
\pgfusepath{stroke}
\end{pgfscope}
\pgfpathmoveto{\pgfqpoint{0.815cm}{1.793cm}}
\pgfpathcurveto{\pgfqpoint{0.815cm}{1.829cm}}{\pgfqpoint{0.801cm}{1.864cm}}{\pgfqpoint{0.775cm}{1.89cm}}
\pgfpathcurveto{\pgfqpoint{0.75cm}{1.915cm}}{\pgfqpoint{0.715cm}{1.93cm}}{\pgfqpoint{0.679cm}{1.93cm}}
\pgfpathcurveto{\pgfqpoint{0.643cm}{1.93cm}}{\pgfqpoint{0.608cm}{1.915cm}}{\pgfqpoint{0.582cm}{1.89cm}}
\pgfpathcurveto{\pgfqpoint{0.557cm}{1.864cm}}{\pgfqpoint{0.542cm}{1.829cm}}{\pgfqpoint{0.542cm}{1.793cm}}
\pgfpathcurveto{\pgfqpoint{0.542cm}{1.756cm}}{\pgfqpoint{0.557cm}{1.722cm}}{\pgfqpoint{0.582cm}{1.696cm}}
\pgfpathcurveto{\pgfqpoint{0.608cm}{1.67cm}}{\pgfqpoint{0.643cm}{1.656cm}}{\pgfqpoint{0.679cm}{1.656cm}}
\pgfpathcurveto{\pgfqpoint{0.715cm}{1.656cm}}{\pgfqpoint{0.75cm}{1.67cm}}{\pgfqpoint{0.775cm}{1.696cm}}
\pgfpathcurveto{\pgfqpoint{0.801cm}{1.722cm}}{\pgfqpoint{0.815cm}{1.756cm}}{\pgfqpoint{0.815cm}{1.793cm}}
\pgfusepath{fill}
\pgfpathmoveto{\pgfqpoint{1.345cm}{1.765cm}}
\pgfpathcurveto{\pgfqpoint{1.345cm}{1.801cm}}{\pgfqpoint{1.331cm}{1.836cm}}{\pgfqpoint{1.305cm}{1.862cm}}
\pgfpathcurveto{\pgfqpoint{1.28cm}{1.887cm}}{\pgfqpoint{1.245cm}{1.902cm}}{\pgfqpoint{1.209cm}{1.902cm}}
\pgfpathcurveto{\pgfqpoint{1.172cm}{1.902cm}}{\pgfqpoint{1.138cm}{1.887cm}}{\pgfqpoint{1.112cm}{1.862cm}}
\pgfpathcurveto{\pgfqpoint{1.087cm}{1.836cm}}{\pgfqpoint{1.072cm}{1.801cm}}{\pgfqpoint{1.072cm}{1.765cm}}
\pgfpathcurveto{\pgfqpoint{1.072cm}{1.728cm}}{\pgfqpoint{1.087cm}{1.694cm}}{\pgfqpoint{1.112cm}{1.668cm}}
\pgfpathcurveto{\pgfqpoint{1.138cm}{1.642cm}}{\pgfqpoint{1.172cm}{1.628cm}}{\pgfqpoint{1.209cm}{1.628cm}}
\pgfpathcurveto{\pgfqpoint{1.245cm}{1.628cm}}{\pgfqpoint{1.28cm}{1.642cm}}{\pgfqpoint{1.305cm}{1.668cm}}
\pgfpathcurveto{\pgfqpoint{1.331cm}{1.694cm}}{\pgfqpoint{1.345cm}{1.728cm}}{\pgfqpoint{1.345cm}{1.765cm}}
\pgfusepath{fill}
\begin{pgfscope}
\pgfsetdash{}{0cm}
\pgfsetlinewidth{0.818mm}
\pgfsetroundcap
\pgfsetroundjoin
\pgfsetmiterlimit{7.0}
\pgfpathmoveto{\pgfqpoint{0.682cm}{1.065cm}}
\pgfpathlineto{\pgfqpoint{1.246cm}{0.315cm}}
\pgfpathlineto{\pgfqpoint{1.811cm}{1.065cm}}
\pgfusepath{stroke}
\end{pgfscope}
\pgfpathmoveto{\pgfqpoint{1.948cm}{1.065cm}}
\pgfpathcurveto{\pgfqpoint{1.948cm}{1.101cm}}{\pgfqpoint{1.933cm}{1.136cm}}{\pgfqpoint{1.907cm}{1.162cm}}
\pgfpathcurveto{\pgfqpoint{1.882cm}{1.187cm}}{\pgfqpoint{1.847cm}{1.202cm}}{\pgfqpoint{1.811cm}{1.202cm}}
\pgfpathcurveto{\pgfqpoint{1.775cm}{1.202cm}}{\pgfqpoint{1.74cm}{1.187cm}}{\pgfqpoint{1.714cm}{1.162cm}}
\pgfpathcurveto{\pgfqpoint{1.689cm}{1.136cm}}{\pgfqpoint{1.674cm}{1.101cm}}{\pgfqpoint{1.674cm}{1.065cm}}
\pgfpathcurveto{\pgfqpoint{1.674cm}{1.029cm}}{\pgfqpoint{1.689cm}{0.994cm}}{\pgfqpoint{1.714cm}{0.968cm}}
\pgfpathcurveto{\pgfqpoint{1.74cm}{0.942cm}}{\pgfqpoint{1.775cm}{0.928cm}}{\pgfqpoint{1.811cm}{0.928cm}}
\pgfpathcurveto{\pgfqpoint{1.847cm}{0.928cm}}{\pgfqpoint{1.882cm}{0.942cm}}{\pgfqpoint{1.907cm}{0.968cm}}
\pgfpathcurveto{\pgfqpoint{1.933cm}{0.994cm}}{\pgfqpoint{1.948cm}{1.029cm}}{\pgfqpoint{1.948cm}{1.065cm}}
\pgfusepath{fill}
\begin{pgfscope}
\pgfsetdash{}{0cm}
\pgfsetlinewidth{0.818mm}
\pgfsetmiterlimit{4.0}
\pgfpathmoveto{\pgfqpoint{1.383cm}{0.178cm}}
\pgfpathcurveto{\pgfqpoint{1.383cm}{0.214cm}}{\pgfqpoint{1.369cm}{0.249cm}}{\pgfqpoint{1.343cm}{0.275cm}}
\pgfpathcurveto{\pgfqpoint{1.317cm}{0.3cm}}{\pgfqpoint{1.283cm}{0.315cm}}{\pgfqpoint{1.246cm}{0.315cm}}
\pgfpathcurveto{\pgfqpoint{1.21cm}{0.315cm}}{\pgfqpoint{1.175cm}{0.3cm}}{\pgfqpoint{1.15cm}{0.275cm}}
\pgfpathcurveto{\pgfqpoint{1.124cm}{0.249cm}}{\pgfqpoint{1.11cm}{0.214cm}}{\pgfqpoint{1.11cm}{0.178cm}}
\pgfpathcurveto{\pgfqpoint{1.11cm}{0.141cm}}{\pgfqpoint{1.124cm}{0.107cm}}{\pgfqpoint{1.15cm}{0.081cm}}
\pgfpathcurveto{\pgfqpoint{1.175cm}{0.055cm}}{\pgfqpoint{1.21cm}{0.041cm}}{\pgfqpoint{1.246cm}{0.041cm}}
\pgfpathcurveto{\pgfqpoint{1.283cm}{0.041cm}}{\pgfqpoint{1.317cm}{0.055cm}}{\pgfqpoint{1.343cm}{0.081cm}}
\pgfpathcurveto{\pgfqpoint{1.369cm}{0.107cm}}{\pgfqpoint{1.383cm}{0.141cm}}{\pgfqpoint{1.383cm}{0.178cm}}
\pgfusepath{stroke}
\end{pgfscope}
\end{pgfscope}
\end{pgfscope}
\end{pgfscope}
\end{tikzpicture}}} \|_{\CC^{-\kappa}(\rho^\sigma)}.
\end{equation}
and finally
\begin{align*}
\|3\UU_> X\succ(\phi+\psi)^2\|_{\CC^{-2+\alpha}(\rho)}
&\lesssim \|\phi+\psi\|^2_{L^\infty(\rho)}\|\UU_>X\|_{\CC^{-2+\alpha}(\rho^{-1})}\\
&\lesssim 2^{-(\frac{3}{2}-\alpha-\kappa)\frac{4}{3}K}\|\phi+\psi\|_{L^\infty(\rho)}^2,
\end{align*}
provided
\begin{equation}\label{eq:l5}
\|\UU_> X\|_{\CC^{-2+\alpha}(\rho^{-1})}\lesssim 2^{-(\frac{3}{2}-\alpha-\kappa)\frac{4}{3}K} \|X\|_{\CC^{-\frac{1}{2}-\kappa}(\rho^\sigma)}.
\end{equation}

In view of Lemma \ref{lem:local}, once the weight $\rho$ is fixed, the value of $L$ completely determines how the associated localizers $\UU_>$ and $\UU_{\leq}$ are defined. The above considerations  and in particular \eqref{eq:l1}, \eqref{eq:l2}, \eqref{eq:l3}, \eqref{eq:l4}, \eqref{eq:l5} lead us  the  values of $L$ for various objects in our expansion summarized  (in chronological order) in Table \ref{t:loc}.

  \begin{table}
    \begin{center}
  \begin{tabular}{| r | c | c | c | c|c|}
  \hline
  Object &  $ \llbracket X^2 \rrbracket$ & $ X^{\!\resizebox{!}{.8em}{
\begin{tikzpicture}
\pgfpathmoveto{\pgfqpoint{0cm}{-0.035cm}}
\pgfpathlineto{\pgfqpoint{1.976cm}{-0.035cm}}
\pgfpathlineto{\pgfqpoint{1.976cm}{1.94cm}}
\pgfpathlineto{\pgfqpoint{0cm}{1.94cm}}
\pgfpathclose
\pgfusepath{clip}
\begin{pgfscope}
\begin{pgfscope}
\pgfpathmoveto{\pgfqpoint{0cm}{-0.035cm}}
\pgfpathlineto{\pgfqpoint{1.976cm}{-0.035cm}}
\pgfpathlineto{\pgfqpoint{1.976cm}{1.94cm}}
\pgfpathlineto{\pgfqpoint{0cm}{1.94cm}}
\pgfpathclose
\pgfusepath{clip}
\begin{pgfscope}
\begin{pgfscope}
\pgfsetdash{}{0cm}
\pgfsetlinewidth{0.818mm}
\pgfsetroundcap
\pgfsetroundjoin
\pgfsetmiterlimit{7.0}
\definecolor{eps2pgf_color}{gray}{0}\pgfsetstrokecolor{eps2pgf_color}\pgfsetfillcolor{eps2pgf_color}
\pgfpathmoveto{\pgfqpoint{0.117cm}{1.815cm}}
\pgfpathlineto{\pgfqpoint{0.682cm}{1.065cm}}
\pgfpathlineto{\pgfqpoint{1.246cm}{1.815cm}}
\pgfusepath{stroke}
\end{pgfscope}
\definecolor{eps2pgf_color}{gray}{0}\pgfsetstrokecolor{eps2pgf_color}\pgfsetfillcolor{eps2pgf_color}
\pgfpathmoveto{\pgfqpoint{0.273cm}{1.789cm}}
\pgfpathcurveto{\pgfqpoint{0.273cm}{1.825cm}}{\pgfqpoint{0.259cm}{1.86cm}}{\pgfqpoint{0.233cm}{1.886cm}}
\pgfpathcurveto{\pgfqpoint{0.207cm}{1.912cm}}{\pgfqpoint{0.173cm}{1.926cm}}{\pgfqpoint{0.137cm}{1.926cm}}
\pgfpathcurveto{\pgfqpoint{0.1cm}{1.926cm}}{\pgfqpoint{0.066cm}{1.912cm}}{\pgfqpoint{0.04cm}{1.886cm}}
\pgfpathcurveto{\pgfqpoint{0.014cm}{1.86cm}}{\pgfqpoint{0cm}{1.825cm}}{\pgfqpoint{0cm}{1.789cm}}
\pgfpathcurveto{\pgfqpoint{0cm}{1.753cm}}{\pgfqpoint{0.014cm}{1.718cm}}{\pgfqpoint{0.04cm}{1.692cm}}
\pgfpathcurveto{\pgfqpoint{0.066cm}{1.667cm}}{\pgfqpoint{0.1cm}{1.652cm}}{\pgfqpoint{0.137cm}{1.652cm}}
\pgfpathcurveto{\pgfqpoint{0.173cm}{1.652cm}}{\pgfqpoint{0.207cm}{1.667cm}}{\pgfqpoint{0.233cm}{1.692cm}}
\pgfpathcurveto{\pgfqpoint{0.259cm}{1.718cm}}{\pgfqpoint{0.273cm}{1.753cm}}{\pgfqpoint{0.273cm}{1.789cm}}
\pgfusepath{fill}
\pgfpathmoveto{\pgfqpoint{1.345cm}{1.765cm}}
\pgfpathcurveto{\pgfqpoint{1.345cm}{1.801cm}}{\pgfqpoint{1.331cm}{1.836cm}}{\pgfqpoint{1.305cm}{1.862cm}}
\pgfpathcurveto{\pgfqpoint{1.28cm}{1.887cm}}{\pgfqpoint{1.245cm}{1.902cm}}{\pgfqpoint{1.209cm}{1.902cm}}
\pgfpathcurveto{\pgfqpoint{1.172cm}{1.902cm}}{\pgfqpoint{1.138cm}{1.887cm}}{\pgfqpoint{1.112cm}{1.862cm}}
\pgfpathcurveto{\pgfqpoint{1.087cm}{1.836cm}}{\pgfqpoint{1.072cm}{1.801cm}}{\pgfqpoint{1.072cm}{1.765cm}}
\pgfpathcurveto{\pgfqpoint{1.072cm}{1.728cm}}{\pgfqpoint{1.087cm}{1.694cm}}{\pgfqpoint{1.112cm}{1.668cm}}
\pgfpathcurveto{\pgfqpoint{1.138cm}{1.642cm}}{\pgfqpoint{1.172cm}{1.628cm}}{\pgfqpoint{1.209cm}{1.628cm}}
\pgfpathcurveto{\pgfqpoint{1.245cm}{1.628cm}}{\pgfqpoint{1.28cm}{1.642cm}}{\pgfqpoint{1.305cm}{1.668cm}}
\pgfpathcurveto{\pgfqpoint{1.331cm}{1.694cm}}{\pgfqpoint{1.345cm}{1.728cm}}{\pgfqpoint{1.345cm}{1.765cm}}
\pgfusepath{fill}
\begin{pgfscope}
\pgfsetdash{}{0cm}
\pgfsetlinewidth{0.818mm}
\pgfsetroundcap
\pgfsetroundjoin
\pgfsetmiterlimit{7.0}
\pgfpathmoveto{\pgfqpoint{0.682cm}{1.065cm}}
\pgfpathlineto{\pgfqpoint{1.246cm}{0.315cm}}
\pgfpathlineto{\pgfqpoint{1.811cm}{1.065cm}}
\pgfusepath{stroke}
\end{pgfscope}
\pgfpathmoveto{\pgfqpoint{1.948cm}{1.065cm}}
\pgfpathcurveto{\pgfqpoint{1.948cm}{1.101cm}}{\pgfqpoint{1.933cm}{1.136cm}}{\pgfqpoint{1.907cm}{1.162cm}}
\pgfpathcurveto{\pgfqpoint{1.882cm}{1.187cm}}{\pgfqpoint{1.847cm}{1.202cm}}{\pgfqpoint{1.811cm}{1.202cm}}
\pgfpathcurveto{\pgfqpoint{1.775cm}{1.202cm}}{\pgfqpoint{1.74cm}{1.187cm}}{\pgfqpoint{1.714cm}{1.162cm}}
\pgfpathcurveto{\pgfqpoint{1.689cm}{1.136cm}}{\pgfqpoint{1.674cm}{1.101cm}}{\pgfqpoint{1.674cm}{1.065cm}}
\pgfpathcurveto{\pgfqpoint{1.674cm}{1.029cm}}{\pgfqpoint{1.689cm}{0.994cm}}{\pgfqpoint{1.714cm}{0.968cm}}
\pgfpathcurveto{\pgfqpoint{1.74cm}{0.942cm}}{\pgfqpoint{1.775cm}{0.928cm}}{\pgfqpoint{1.811cm}{0.928cm}}
\pgfpathcurveto{\pgfqpoint{1.847cm}{0.928cm}}{\pgfqpoint{1.882cm}{0.942cm}}{\pgfqpoint{1.907cm}{0.968cm}}
\pgfpathcurveto{\pgfqpoint{1.933cm}{0.994cm}}{\pgfqpoint{1.948cm}{1.029cm}}{\pgfqpoint{1.948cm}{1.065cm}}
\pgfusepath{fill}
\begin{pgfscope}
\pgfsetdash{}{0cm}
\pgfsetlinewidth{0.818mm}
\pgfsetmiterlimit{7.0}
\pgfpathmoveto{\pgfqpoint{1.246cm}{0.315cm}}
\pgfpathlineto{\pgfqpoint{1.244cm}{1.061cm}}
\pgfusepath{stroke}
\end{pgfscope}
\pgfpathmoveto{\pgfqpoint{1.38cm}{1.065cm}}
\pgfpathcurveto{\pgfqpoint{1.38cm}{1.101cm}}{\pgfqpoint{1.366cm}{1.136cm}}{\pgfqpoint{1.34cm}{1.162cm}}
\pgfpathcurveto{\pgfqpoint{1.315cm}{1.187cm}}{\pgfqpoint{1.28cm}{1.202cm}}{\pgfqpoint{1.244cm}{1.202cm}}
\pgfpathcurveto{\pgfqpoint{1.207cm}{1.202cm}}{\pgfqpoint{1.173cm}{1.187cm}}{\pgfqpoint{1.147cm}{1.162cm}}
\pgfpathcurveto{\pgfqpoint{1.121cm}{1.136cm}}{\pgfqpoint{1.107cm}{1.101cm}}{\pgfqpoint{1.107cm}{1.065cm}}
\pgfpathcurveto{\pgfqpoint{1.107cm}{1.029cm}}{\pgfqpoint{1.121cm}{0.994cm}}{\pgfqpoint{1.147cm}{0.968cm}}
\pgfpathcurveto{\pgfqpoint{1.173cm}{0.942cm}}{\pgfqpoint{1.207cm}{0.928cm}}{\pgfqpoint{1.244cm}{0.928cm}}
\pgfpathcurveto{\pgfqpoint{1.28cm}{0.928cm}}{\pgfqpoint{1.315cm}{0.942cm}}{\pgfqpoint{1.34cm}{0.968cm}}
\pgfpathcurveto{\pgfqpoint{1.366cm}{0.994cm}}{\pgfqpoint{1.38cm}{1.029cm}}{\pgfqpoint{1.38cm}{1.065cm}}
\pgfusepath{fill}
\begin{pgfscope}
\pgfsetdash{}{0cm}
\pgfsetlinewidth{0.818mm}
\pgfsetmiterlimit{4.0}
\pgfpathmoveto{\pgfqpoint{1.383cm}{0.178cm}}
\pgfpathcurveto{\pgfqpoint{1.383cm}{0.214cm}}{\pgfqpoint{1.369cm}{0.249cm}}{\pgfqpoint{1.343cm}{0.275cm}}
\pgfpathcurveto{\pgfqpoint{1.317cm}{0.3cm}}{\pgfqpoint{1.283cm}{0.315cm}}{\pgfqpoint{1.246cm}{0.315cm}}
\pgfpathcurveto{\pgfqpoint{1.21cm}{0.315cm}}{\pgfqpoint{1.175cm}{0.3cm}}{\pgfqpoint{1.15cm}{0.275cm}}
\pgfpathcurveto{\pgfqpoint{1.124cm}{0.249cm}}{\pgfqpoint{1.11cm}{0.214cm}}{\pgfqpoint{1.11cm}{0.178cm}}
\pgfpathcurveto{\pgfqpoint{1.11cm}{0.141cm}}{\pgfqpoint{1.124cm}{0.107cm}}{\pgfqpoint{1.15cm}{0.081cm}}
\pgfpathcurveto{\pgfqpoint{1.175cm}{0.055cm}}{\pgfqpoint{1.21cm}{0.041cm}}{\pgfqpoint{1.246cm}{0.041cm}}
\pgfpathcurveto{\pgfqpoint{1.283cm}{0.041cm}}{\pgfqpoint{1.317cm}{0.055cm}}{\pgfqpoint{1.343cm}{0.081cm}}
\pgfpathcurveto{\pgfqpoint{1.369cm}{0.107cm}}{\pgfqpoint{1.383cm}{0.141cm}}{\pgfqpoint{1.383cm}{0.178cm}}
\pgfusepath{stroke}
\end{pgfscope}
\end{pgfscope}
\end{pgfscope}
\end{pgfscope}
\end{tikzpicture}}}$  & $X$ &  $X^{\!\resizebox{!}{.8em}{
\begin{tikzpicture}
\pgfpathmoveto{\pgfqpoint{0cm}{-0.035cm}}
\pgfpathlineto{\pgfqpoint{1.976cm}{-0.035cm}}
\pgfpathlineto{\pgfqpoint{1.976cm}{1.94cm}}
\pgfpathlineto{\pgfqpoint{0cm}{1.94cm}}
\pgfpathclose
\pgfusepath{clip}
\begin{pgfscope}
\begin{pgfscope}
\pgfpathmoveto{\pgfqpoint{0cm}{-0.035cm}}
\pgfpathlineto{\pgfqpoint{1.976cm}{-0.035cm}}
\pgfpathlineto{\pgfqpoint{1.976cm}{1.94cm}}
\pgfpathlineto{\pgfqpoint{0cm}{1.94cm}}
\pgfpathclose
\pgfusepath{clip}
\begin{pgfscope}
\begin{pgfscope}
\pgfsetdash{}{0cm}
\pgfsetlinewidth{0.818mm}
\pgfsetroundcap
\pgfsetroundjoin
\pgfsetmiterlimit{7.0}
\definecolor{eps2pgf_color}{gray}{0}\pgfsetstrokecolor{eps2pgf_color}\pgfsetfillcolor{eps2pgf_color}
\pgfpathmoveto{\pgfqpoint{0.117cm}{1.815cm}}
\pgfpathlineto{\pgfqpoint{0.682cm}{1.065cm}}
\pgfpathlineto{\pgfqpoint{1.246cm}{1.815cm}}
\pgfusepath{stroke}
\end{pgfscope}
\definecolor{eps2pgf_color}{gray}{0}\pgfsetstrokecolor{eps2pgf_color}\pgfsetfillcolor{eps2pgf_color}
\pgfpathmoveto{\pgfqpoint{0.273cm}{1.789cm}}
\pgfpathcurveto{\pgfqpoint{0.273cm}{1.825cm}}{\pgfqpoint{0.259cm}{1.86cm}}{\pgfqpoint{0.233cm}{1.886cm}}
\pgfpathcurveto{\pgfqpoint{0.207cm}{1.912cm}}{\pgfqpoint{0.173cm}{1.926cm}}{\pgfqpoint{0.137cm}{1.926cm}}
\pgfpathcurveto{\pgfqpoint{0.1cm}{1.926cm}}{\pgfqpoint{0.066cm}{1.912cm}}{\pgfqpoint{0.04cm}{1.886cm}}
\pgfpathcurveto{\pgfqpoint{0.014cm}{1.86cm}}{\pgfqpoint{0cm}{1.825cm}}{\pgfqpoint{0cm}{1.789cm}}
\pgfpathcurveto{\pgfqpoint{0cm}{1.753cm}}{\pgfqpoint{0.014cm}{1.718cm}}{\pgfqpoint{0.04cm}{1.692cm}}
\pgfpathcurveto{\pgfqpoint{0.066cm}{1.667cm}}{\pgfqpoint{0.1cm}{1.652cm}}{\pgfqpoint{0.137cm}{1.652cm}}
\pgfpathcurveto{\pgfqpoint{0.173cm}{1.652cm}}{\pgfqpoint{0.207cm}{1.667cm}}{\pgfqpoint{0.233cm}{1.692cm}}
\pgfpathcurveto{\pgfqpoint{0.259cm}{1.718cm}}{\pgfqpoint{0.273cm}{1.753cm}}{\pgfqpoint{0.273cm}{1.789cm}}
\pgfusepath{fill}
\begin{pgfscope}
\pgfsetdash{}{0cm}
\pgfsetlinewidth{0.818mm}
\pgfsetmiterlimit{7.0}
\pgfpathmoveto{\pgfqpoint{0.682cm}{1.065cm}}
\pgfpathlineto{\pgfqpoint{0.679cm}{1.812cm}}
\pgfusepath{stroke}
\end{pgfscope}
\pgfpathmoveto{\pgfqpoint{0.815cm}{1.793cm}}
\pgfpathcurveto{\pgfqpoint{0.815cm}{1.829cm}}{\pgfqpoint{0.801cm}{1.864cm}}{\pgfqpoint{0.775cm}{1.89cm}}
\pgfpathcurveto{\pgfqpoint{0.75cm}{1.915cm}}{\pgfqpoint{0.715cm}{1.93cm}}{\pgfqpoint{0.679cm}{1.93cm}}
\pgfpathcurveto{\pgfqpoint{0.643cm}{1.93cm}}{\pgfqpoint{0.608cm}{1.915cm}}{\pgfqpoint{0.582cm}{1.89cm}}
\pgfpathcurveto{\pgfqpoint{0.557cm}{1.864cm}}{\pgfqpoint{0.542cm}{1.829cm}}{\pgfqpoint{0.542cm}{1.793cm}}
\pgfpathcurveto{\pgfqpoint{0.542cm}{1.756cm}}{\pgfqpoint{0.557cm}{1.722cm}}{\pgfqpoint{0.582cm}{1.696cm}}
\pgfpathcurveto{\pgfqpoint{0.608cm}{1.67cm}}{\pgfqpoint{0.643cm}{1.656cm}}{\pgfqpoint{0.679cm}{1.656cm}}
\pgfpathcurveto{\pgfqpoint{0.715cm}{1.656cm}}{\pgfqpoint{0.75cm}{1.67cm}}{\pgfqpoint{0.775cm}{1.696cm}}
\pgfpathcurveto{\pgfqpoint{0.801cm}{1.722cm}}{\pgfqpoint{0.815cm}{1.756cm}}{\pgfqpoint{0.815cm}{1.793cm}}
\pgfusepath{fill}
\pgfpathmoveto{\pgfqpoint{1.345cm}{1.765cm}}
\pgfpathcurveto{\pgfqpoint{1.345cm}{1.801cm}}{\pgfqpoint{1.331cm}{1.836cm}}{\pgfqpoint{1.305cm}{1.862cm}}
\pgfpathcurveto{\pgfqpoint{1.28cm}{1.887cm}}{\pgfqpoint{1.245cm}{1.902cm}}{\pgfqpoint{1.209cm}{1.902cm}}
\pgfpathcurveto{\pgfqpoint{1.172cm}{1.902cm}}{\pgfqpoint{1.138cm}{1.887cm}}{\pgfqpoint{1.112cm}{1.862cm}}
\pgfpathcurveto{\pgfqpoint{1.087cm}{1.836cm}}{\pgfqpoint{1.072cm}{1.801cm}}{\pgfqpoint{1.072cm}{1.765cm}}
\pgfpathcurveto{\pgfqpoint{1.072cm}{1.728cm}}{\pgfqpoint{1.087cm}{1.694cm}}{\pgfqpoint{1.112cm}{1.668cm}}
\pgfpathcurveto{\pgfqpoint{1.138cm}{1.642cm}}{\pgfqpoint{1.172cm}{1.628cm}}{\pgfqpoint{1.209cm}{1.628cm}}
\pgfpathcurveto{\pgfqpoint{1.245cm}{1.628cm}}{\pgfqpoint{1.28cm}{1.642cm}}{\pgfqpoint{1.305cm}{1.668cm}}
\pgfpathcurveto{\pgfqpoint{1.331cm}{1.694cm}}{\pgfqpoint{1.345cm}{1.728cm}}{\pgfqpoint{1.345cm}{1.765cm}}
\pgfusepath{fill}
\begin{pgfscope}
\pgfsetdash{}{0cm}
\pgfsetlinewidth{0.818mm}
\pgfsetroundcap
\pgfsetroundjoin
\pgfsetmiterlimit{7.0}
\pgfpathmoveto{\pgfqpoint{0.682cm}{1.065cm}}
\pgfpathlineto{\pgfqpoint{1.246cm}{0.315cm}}
\pgfpathlineto{\pgfqpoint{1.811cm}{1.065cm}}
\pgfusepath{stroke}
\end{pgfscope}
\pgfpathmoveto{\pgfqpoint{1.948cm}{1.065cm}}
\pgfpathcurveto{\pgfqpoint{1.948cm}{1.101cm}}{\pgfqpoint{1.933cm}{1.136cm}}{\pgfqpoint{1.907cm}{1.162cm}}
\pgfpathcurveto{\pgfqpoint{1.882cm}{1.187cm}}{\pgfqpoint{1.847cm}{1.202cm}}{\pgfqpoint{1.811cm}{1.202cm}}
\pgfpathcurveto{\pgfqpoint{1.775cm}{1.202cm}}{\pgfqpoint{1.74cm}{1.187cm}}{\pgfqpoint{1.714cm}{1.162cm}}
\pgfpathcurveto{\pgfqpoint{1.689cm}{1.136cm}}{\pgfqpoint{1.674cm}{1.101cm}}{\pgfqpoint{1.674cm}{1.065cm}}
\pgfpathcurveto{\pgfqpoint{1.674cm}{1.029cm}}{\pgfqpoint{1.689cm}{0.994cm}}{\pgfqpoint{1.714cm}{0.968cm}}
\pgfpathcurveto{\pgfqpoint{1.74cm}{0.942cm}}{\pgfqpoint{1.775cm}{0.928cm}}{\pgfqpoint{1.811cm}{0.928cm}}
\pgfpathcurveto{\pgfqpoint{1.847cm}{0.928cm}}{\pgfqpoint{1.882cm}{0.942cm}}{\pgfqpoint{1.907cm}{0.968cm}}
\pgfpathcurveto{\pgfqpoint{1.933cm}{0.994cm}}{\pgfqpoint{1.948cm}{1.029cm}}{\pgfqpoint{1.948cm}{1.065cm}}
\pgfusepath{fill}
\begin{pgfscope}
\pgfsetdash{}{0cm}
\pgfsetlinewidth{0.818mm}
\pgfsetmiterlimit{4.0}
\pgfpathmoveto{\pgfqpoint{1.383cm}{0.178cm}}
\pgfpathcurveto{\pgfqpoint{1.383cm}{0.214cm}}{\pgfqpoint{1.369cm}{0.249cm}}{\pgfqpoint{1.343cm}{0.275cm}}
\pgfpathcurveto{\pgfqpoint{1.317cm}{0.3cm}}{\pgfqpoint{1.283cm}{0.315cm}}{\pgfqpoint{1.246cm}{0.315cm}}
\pgfpathcurveto{\pgfqpoint{1.21cm}{0.315cm}}{\pgfqpoint{1.175cm}{0.3cm}}{\pgfqpoint{1.15cm}{0.275cm}}
\pgfpathcurveto{\pgfqpoint{1.124cm}{0.249cm}}{\pgfqpoint{1.11cm}{0.214cm}}{\pgfqpoint{1.11cm}{0.178cm}}
\pgfpathcurveto{\pgfqpoint{1.11cm}{0.141cm}}{\pgfqpoint{1.124cm}{0.107cm}}{\pgfqpoint{1.15cm}{0.081cm}}
\pgfpathcurveto{\pgfqpoint{1.175cm}{0.055cm}}{\pgfqpoint{1.21cm}{0.041cm}}{\pgfqpoint{1.246cm}{0.041cm}}
\pgfpathcurveto{\pgfqpoint{1.283cm}{0.041cm}}{\pgfqpoint{1.317cm}{0.055cm}}{\pgfqpoint{1.343cm}{0.081cm}}
\pgfpathcurveto{\pgfqpoint{1.369cm}{0.107cm}}{\pgfqpoint{1.383cm}{0.141cm}}{\pgfqpoint{1.383cm}{0.178cm}}
\pgfusepath{stroke}
\end{pgfscope}
\end{pgfscope}
\end{pgfscope}
\end{pgfscope}
\end{tikzpicture}}}$ & $ X$\\ \hline 
  $L$ & $K$ & $\frac{K}{2}$ & $\frac{2K}{3} $ & $\frac{K}{2}$ & $\frac{4K}{3}$  \\
  \hline
  \end{tabular}
  \caption{The value of parameter $L$ used to construct $\UU_>$, $\UU_\leq$.}
  \label{t:loc}
    \end{center}
    \end{table}

Collecting all the above estimates and using the Schauder estimates we  deduce that 
\begin{align*}
&\|\phi\|_{L^\infty(\rho)}\lesssim  \|\phi\|_{\CC^\alpha(\rho)} \\
&\lesssim\|\Phi\|_{\CC^{-2+\alpha}(\rho)}\lesssim 1+2^{-(1-\alpha-\kappa)K}\|\phi+\psi\|_{L^\infty(\rho)}+2^{-(\frac{3}{2}-\alpha-\kappa)\frac{4}{3}K}\|\phi+\psi\|^2_{L^\infty(\rho)}
\end{align*}
\rmbb{\rmbb{and this leads us to a precise choice of the parameter $K>0$. In particular, we proceed as in Section \ref{ssec:apr} and let $K>0$ be such that $1+\|\phi+\psi\|_{L^\infty(\rho)}= 2^{(1-\kappa-\alpha)K}$. Then  in view of the embedding \eqref{eq:emb}}
\begin{equation}\label{eq:10}
\|\phi\|_{L^\infty(\rho)}+ \|\phi\|_{\CC^\alpha(\rho)}\lesssim 1,
\end{equation}
  \rmbb{where the right hand side is independent of $ K$.} So we have that
  \begin{equation}\label{eq:5.15}
  2^{(1-\kappa-\alpha)K}\lesssim 1+\|\psi\|_{L^\infty(\rho)}.
  \end{equation}
 \rmbb{The parameter $K$ as well as the corresponding localizers given according to Table \ref{t:loc} remain fixed for the rest of the analysis of the elliptic $\Phi^{4}_{5}$ model}. }

%

\subsection{Bound for $\phi$ in $\CC^{\frac{1}{2}+\alpha}(\rho^{\frac{3}{2}+\alpha})$}
\label{ssec:phi2}

As the next step, we estimate $\Phi$ in $\CC^{-\frac{3}{2}+\alpha}(\rho^{\frac{3}{2}+\alpha})$. It will be seen below that all the terms except for the one which is quadratic in $\phi+\psi$ can be even estimated in $\CC^{-\frac{3}{2}+\alpha}(\rho)$. 
 \rmbb{Recall that the parameter $K>0$  fixed in the previous section  determined the value of $L$ for each application of the localization operators, see Table \ref{t:loc}. }
In view of Lemma \ref{lem:local}, the sequence $(L_k)_{k\in\N_0}$ is therefore also fixed (and possibly different for each application of the  localizing operators). \rmbb{Below, we use the bound \eqref{eq:5.15} in order to control various powers of $2^{ K}$ by  $\|\psi\|^{\varepsilon}_{L^{\infty}(\rho)}$, where  $\varepsilon\in(0,1)$ is a generic constant \rmbb{independent of $K$} whose value changes from line to line. Accordingly, Lemma \ref{lem:local} yields}
$$
\|\UU_> \llbracket X^2 \rrbracket\|_{\CC^{-\frac{3}{2}+\alpha}}\lesssim 2^{-(\frac{1}{2}-\alpha-\kappa)K} \|\llbracket X^2 \rrbracket\|_{\CC^{-1-\kappa}(\rho^\sigma)},
$$
$$
\|\UU_> X^{\!\resizebox{!}{.8em}{
\begin{tikzpicture}
\pgfpathmoveto{\pgfqpoint{0cm}{-0.035cm}}
\pgfpathlineto{\pgfqpoint{1.976cm}{-0.035cm}}
\pgfpathlineto{\pgfqpoint{1.976cm}{1.94cm}}
\pgfpathlineto{\pgfqpoint{0cm}{1.94cm}}
\pgfpathclose
\pgfusepath{clip}
\begin{pgfscope}
\begin{pgfscope}
\pgfpathmoveto{\pgfqpoint{0cm}{-0.035cm}}
\pgfpathlineto{\pgfqpoint{1.976cm}{-0.035cm}}
\pgfpathlineto{\pgfqpoint{1.976cm}{1.94cm}}
\pgfpathlineto{\pgfqpoint{0cm}{1.94cm}}
\pgfpathclose
\pgfusepath{clip}
\begin{pgfscope}
\begin{pgfscope}
\pgfsetdash{}{0cm}
\pgfsetlinewidth{0.818mm}
\pgfsetroundcap
\pgfsetroundjoin
\pgfsetmiterlimit{7.0}
\definecolor{eps2pgf_color}{gray}{0}\pgfsetstrokecolor{eps2pgf_color}\pgfsetfillcolor{eps2pgf_color}
\pgfpathmoveto{\pgfqpoint{0.117cm}{1.815cm}}
\pgfpathlineto{\pgfqpoint{0.682cm}{1.065cm}}
\pgfpathlineto{\pgfqpoint{1.246cm}{1.815cm}}
\pgfusepath{stroke}
\end{pgfscope}
\definecolor{eps2pgf_color}{gray}{0}\pgfsetstrokecolor{eps2pgf_color}\pgfsetfillcolor{eps2pgf_color}
\pgfpathmoveto{\pgfqpoint{0.273cm}{1.789cm}}
\pgfpathcurveto{\pgfqpoint{0.273cm}{1.825cm}}{\pgfqpoint{0.259cm}{1.86cm}}{\pgfqpoint{0.233cm}{1.886cm}}
\pgfpathcurveto{\pgfqpoint{0.207cm}{1.912cm}}{\pgfqpoint{0.173cm}{1.926cm}}{\pgfqpoint{0.137cm}{1.926cm}}
\pgfpathcurveto{\pgfqpoint{0.1cm}{1.926cm}}{\pgfqpoint{0.066cm}{1.912cm}}{\pgfqpoint{0.04cm}{1.886cm}}
\pgfpathcurveto{\pgfqpoint{0.014cm}{1.86cm}}{\pgfqpoint{0cm}{1.825cm}}{\pgfqpoint{0cm}{1.789cm}}
\pgfpathcurveto{\pgfqpoint{0cm}{1.753cm}}{\pgfqpoint{0.014cm}{1.718cm}}{\pgfqpoint{0.04cm}{1.692cm}}
\pgfpathcurveto{\pgfqpoint{0.066cm}{1.667cm}}{\pgfqpoint{0.1cm}{1.652cm}}{\pgfqpoint{0.137cm}{1.652cm}}
\pgfpathcurveto{\pgfqpoint{0.173cm}{1.652cm}}{\pgfqpoint{0.207cm}{1.667cm}}{\pgfqpoint{0.233cm}{1.692cm}}
\pgfpathcurveto{\pgfqpoint{0.259cm}{1.718cm}}{\pgfqpoint{0.273cm}{1.753cm}}{\pgfqpoint{0.273cm}{1.789cm}}
\pgfusepath{fill}
\pgfpathmoveto{\pgfqpoint{1.345cm}{1.765cm}}
\pgfpathcurveto{\pgfqpoint{1.345cm}{1.801cm}}{\pgfqpoint{1.331cm}{1.836cm}}{\pgfqpoint{1.305cm}{1.862cm}}
\pgfpathcurveto{\pgfqpoint{1.28cm}{1.887cm}}{\pgfqpoint{1.245cm}{1.902cm}}{\pgfqpoint{1.209cm}{1.902cm}}
\pgfpathcurveto{\pgfqpoint{1.172cm}{1.902cm}}{\pgfqpoint{1.138cm}{1.887cm}}{\pgfqpoint{1.112cm}{1.862cm}}
\pgfpathcurveto{\pgfqpoint{1.087cm}{1.836cm}}{\pgfqpoint{1.072cm}{1.801cm}}{\pgfqpoint{1.072cm}{1.765cm}}
\pgfpathcurveto{\pgfqpoint{1.072cm}{1.728cm}}{\pgfqpoint{1.087cm}{1.694cm}}{\pgfqpoint{1.112cm}{1.668cm}}
\pgfpathcurveto{\pgfqpoint{1.138cm}{1.642cm}}{\pgfqpoint{1.172cm}{1.628cm}}{\pgfqpoint{1.209cm}{1.628cm}}
\pgfpathcurveto{\pgfqpoint{1.245cm}{1.628cm}}{\pgfqpoint{1.28cm}{1.642cm}}{\pgfqpoint{1.305cm}{1.668cm}}
\pgfpathcurveto{\pgfqpoint{1.331cm}{1.694cm}}{\pgfqpoint{1.345cm}{1.728cm}}{\pgfqpoint{1.345cm}{1.765cm}}
\pgfusepath{fill}
\begin{pgfscope}
\pgfsetdash{}{0cm}
\pgfsetlinewidth{0.818mm}
\pgfsetroundcap
\pgfsetroundjoin
\pgfsetmiterlimit{7.0}
\pgfpathmoveto{\pgfqpoint{0.682cm}{1.065cm}}
\pgfpathlineto{\pgfqpoint{1.246cm}{0.315cm}}
\pgfpathlineto{\pgfqpoint{1.811cm}{1.065cm}}
\pgfusepath{stroke}
\end{pgfscope}
\pgfpathmoveto{\pgfqpoint{1.948cm}{1.065cm}}
\pgfpathcurveto{\pgfqpoint{1.948cm}{1.101cm}}{\pgfqpoint{1.933cm}{1.136cm}}{\pgfqpoint{1.907cm}{1.162cm}}
\pgfpathcurveto{\pgfqpoint{1.882cm}{1.187cm}}{\pgfqpoint{1.847cm}{1.202cm}}{\pgfqpoint{1.811cm}{1.202cm}}
\pgfpathcurveto{\pgfqpoint{1.775cm}{1.202cm}}{\pgfqpoint{1.74cm}{1.187cm}}{\pgfqpoint{1.714cm}{1.162cm}}
\pgfpathcurveto{\pgfqpoint{1.689cm}{1.136cm}}{\pgfqpoint{1.674cm}{1.101cm}}{\pgfqpoint{1.674cm}{1.065cm}}
\pgfpathcurveto{\pgfqpoint{1.674cm}{1.029cm}}{\pgfqpoint{1.689cm}{0.994cm}}{\pgfqpoint{1.714cm}{0.968cm}}
\pgfpathcurveto{\pgfqpoint{1.74cm}{0.942cm}}{\pgfqpoint{1.775cm}{0.928cm}}{\pgfqpoint{1.811cm}{0.928cm}}
\pgfpathcurveto{\pgfqpoint{1.847cm}{0.928cm}}{\pgfqpoint{1.882cm}{0.942cm}}{\pgfqpoint{1.907cm}{0.968cm}}
\pgfpathcurveto{\pgfqpoint{1.933cm}{0.994cm}}{\pgfqpoint{1.948cm}{1.029cm}}{\pgfqpoint{1.948cm}{1.065cm}}
\pgfusepath{fill}
\begin{pgfscope}
\pgfsetdash{}{0cm}
\pgfsetlinewidth{0.818mm}
\pgfsetmiterlimit{7.0}
\pgfpathmoveto{\pgfqpoint{1.246cm}{0.315cm}}
\pgfpathlineto{\pgfqpoint{1.244cm}{1.061cm}}
\pgfusepath{stroke}
\end{pgfscope}
\pgfpathmoveto{\pgfqpoint{1.38cm}{1.065cm}}
\pgfpathcurveto{\pgfqpoint{1.38cm}{1.101cm}}{\pgfqpoint{1.366cm}{1.136cm}}{\pgfqpoint{1.34cm}{1.162cm}}
\pgfpathcurveto{\pgfqpoint{1.315cm}{1.187cm}}{\pgfqpoint{1.28cm}{1.202cm}}{\pgfqpoint{1.244cm}{1.202cm}}
\pgfpathcurveto{\pgfqpoint{1.207cm}{1.202cm}}{\pgfqpoint{1.173cm}{1.187cm}}{\pgfqpoint{1.147cm}{1.162cm}}
\pgfpathcurveto{\pgfqpoint{1.121cm}{1.136cm}}{\pgfqpoint{1.107cm}{1.101cm}}{\pgfqpoint{1.107cm}{1.065cm}}
\pgfpathcurveto{\pgfqpoint{1.107cm}{1.029cm}}{\pgfqpoint{1.121cm}{0.994cm}}{\pgfqpoint{1.147cm}{0.968cm}}
\pgfpathcurveto{\pgfqpoint{1.173cm}{0.942cm}}{\pgfqpoint{1.207cm}{0.928cm}}{\pgfqpoint{1.244cm}{0.928cm}}
\pgfpathcurveto{\pgfqpoint{1.28cm}{0.928cm}}{\pgfqpoint{1.315cm}{0.942cm}}{\pgfqpoint{1.34cm}{0.968cm}}
\pgfpathcurveto{\pgfqpoint{1.366cm}{0.994cm}}{\pgfqpoint{1.38cm}{1.029cm}}{\pgfqpoint{1.38cm}{1.065cm}}
\pgfusepath{fill}
\begin{pgfscope}
\pgfsetdash{}{0cm}
\pgfsetlinewidth{0.818mm}
\pgfsetmiterlimit{4.0}
\pgfpathmoveto{\pgfqpoint{1.383cm}{0.178cm}}
\pgfpathcurveto{\pgfqpoint{1.383cm}{0.214cm}}{\pgfqpoint{1.369cm}{0.249cm}}{\pgfqpoint{1.343cm}{0.275cm}}
\pgfpathcurveto{\pgfqpoint{1.317cm}{0.3cm}}{\pgfqpoint{1.283cm}{0.315cm}}{\pgfqpoint{1.246cm}{0.315cm}}
\pgfpathcurveto{\pgfqpoint{1.21cm}{0.315cm}}{\pgfqpoint{1.175cm}{0.3cm}}{\pgfqpoint{1.15cm}{0.275cm}}
\pgfpathcurveto{\pgfqpoint{1.124cm}{0.249cm}}{\pgfqpoint{1.11cm}{0.214cm}}{\pgfqpoint{1.11cm}{0.178cm}}
\pgfpathcurveto{\pgfqpoint{1.11cm}{0.141cm}}{\pgfqpoint{1.124cm}{0.107cm}}{\pgfqpoint{1.15cm}{0.081cm}}
\pgfpathcurveto{\pgfqpoint{1.175cm}{0.055cm}}{\pgfqpoint{1.21cm}{0.041cm}}{\pgfqpoint{1.246cm}{0.041cm}}
\pgfpathcurveto{\pgfqpoint{1.283cm}{0.041cm}}{\pgfqpoint{1.317cm}{0.055cm}}{\pgfqpoint{1.343cm}{0.081cm}}
\pgfpathcurveto{\pgfqpoint{1.369cm}{0.107cm}}{\pgfqpoint{1.383cm}{0.141cm}}{\pgfqpoint{1.383cm}{0.178cm}}
\pgfusepath{stroke}
\end{pgfscope}
\end{pgfscope}
\end{pgfscope}
\end{pgfscope}
\end{tikzpicture}}}\|_{\CC^{-\frac{3}{2}+\alpha}}\lesssim 2^{-(\frac{3}{2}-\alpha-\kappa)K/2} \|X^{\!\resizebox{!}{.8em}{
\begin{tikzpicture}
\pgfpathmoveto{\pgfqpoint{0cm}{-0.035cm}}
\pgfpathlineto{\pgfqpoint{1.976cm}{-0.035cm}}
\pgfpathlineto{\pgfqpoint{1.976cm}{1.94cm}}
\pgfpathlineto{\pgfqpoint{0cm}{1.94cm}}
\pgfpathclose
\pgfusepath{clip}
\begin{pgfscope}
\begin{pgfscope}
\pgfpathmoveto{\pgfqpoint{0cm}{-0.035cm}}
\pgfpathlineto{\pgfqpoint{1.976cm}{-0.035cm}}
\pgfpathlineto{\pgfqpoint{1.976cm}{1.94cm}}
\pgfpathlineto{\pgfqpoint{0cm}{1.94cm}}
\pgfpathclose
\pgfusepath{clip}
\begin{pgfscope}
\begin{pgfscope}
\pgfsetdash{}{0cm}
\pgfsetlinewidth{0.818mm}
\pgfsetroundcap
\pgfsetroundjoin
\pgfsetmiterlimit{7.0}
\definecolor{eps2pgf_color}{gray}{0}\pgfsetstrokecolor{eps2pgf_color}\pgfsetfillcolor{eps2pgf_color}
\pgfpathmoveto{\pgfqpoint{0.117cm}{1.815cm}}
\pgfpathlineto{\pgfqpoint{0.682cm}{1.065cm}}
\pgfpathlineto{\pgfqpoint{1.246cm}{1.815cm}}
\pgfusepath{stroke}
\end{pgfscope}
\definecolor{eps2pgf_color}{gray}{0}\pgfsetstrokecolor{eps2pgf_color}\pgfsetfillcolor{eps2pgf_color}
\pgfpathmoveto{\pgfqpoint{0.273cm}{1.789cm}}
\pgfpathcurveto{\pgfqpoint{0.273cm}{1.825cm}}{\pgfqpoint{0.259cm}{1.86cm}}{\pgfqpoint{0.233cm}{1.886cm}}
\pgfpathcurveto{\pgfqpoint{0.207cm}{1.912cm}}{\pgfqpoint{0.173cm}{1.926cm}}{\pgfqpoint{0.137cm}{1.926cm}}
\pgfpathcurveto{\pgfqpoint{0.1cm}{1.926cm}}{\pgfqpoint{0.066cm}{1.912cm}}{\pgfqpoint{0.04cm}{1.886cm}}
\pgfpathcurveto{\pgfqpoint{0.014cm}{1.86cm}}{\pgfqpoint{0cm}{1.825cm}}{\pgfqpoint{0cm}{1.789cm}}
\pgfpathcurveto{\pgfqpoint{0cm}{1.753cm}}{\pgfqpoint{0.014cm}{1.718cm}}{\pgfqpoint{0.04cm}{1.692cm}}
\pgfpathcurveto{\pgfqpoint{0.066cm}{1.667cm}}{\pgfqpoint{0.1cm}{1.652cm}}{\pgfqpoint{0.137cm}{1.652cm}}
\pgfpathcurveto{\pgfqpoint{0.173cm}{1.652cm}}{\pgfqpoint{0.207cm}{1.667cm}}{\pgfqpoint{0.233cm}{1.692cm}}
\pgfpathcurveto{\pgfqpoint{0.259cm}{1.718cm}}{\pgfqpoint{0.273cm}{1.753cm}}{\pgfqpoint{0.273cm}{1.789cm}}
\pgfusepath{fill}
\pgfpathmoveto{\pgfqpoint{1.345cm}{1.765cm}}
\pgfpathcurveto{\pgfqpoint{1.345cm}{1.801cm}}{\pgfqpoint{1.331cm}{1.836cm}}{\pgfqpoint{1.305cm}{1.862cm}}
\pgfpathcurveto{\pgfqpoint{1.28cm}{1.887cm}}{\pgfqpoint{1.245cm}{1.902cm}}{\pgfqpoint{1.209cm}{1.902cm}}
\pgfpathcurveto{\pgfqpoint{1.172cm}{1.902cm}}{\pgfqpoint{1.138cm}{1.887cm}}{\pgfqpoint{1.112cm}{1.862cm}}
\pgfpathcurveto{\pgfqpoint{1.087cm}{1.836cm}}{\pgfqpoint{1.072cm}{1.801cm}}{\pgfqpoint{1.072cm}{1.765cm}}
\pgfpathcurveto{\pgfqpoint{1.072cm}{1.728cm}}{\pgfqpoint{1.087cm}{1.694cm}}{\pgfqpoint{1.112cm}{1.668cm}}
\pgfpathcurveto{\pgfqpoint{1.138cm}{1.642cm}}{\pgfqpoint{1.172cm}{1.628cm}}{\pgfqpoint{1.209cm}{1.628cm}}
\pgfpathcurveto{\pgfqpoint{1.245cm}{1.628cm}}{\pgfqpoint{1.28cm}{1.642cm}}{\pgfqpoint{1.305cm}{1.668cm}}
\pgfpathcurveto{\pgfqpoint{1.331cm}{1.694cm}}{\pgfqpoint{1.345cm}{1.728cm}}{\pgfqpoint{1.345cm}{1.765cm}}
\pgfusepath{fill}
\begin{pgfscope}
\pgfsetdash{}{0cm}
\pgfsetlinewidth{0.818mm}
\pgfsetroundcap
\pgfsetroundjoin
\pgfsetmiterlimit{7.0}
\pgfpathmoveto{\pgfqpoint{0.682cm}{1.065cm}}
\pgfpathlineto{\pgfqpoint{1.246cm}{0.315cm}}
\pgfpathlineto{\pgfqpoint{1.811cm}{1.065cm}}
\pgfusepath{stroke}
\end{pgfscope}
\pgfpathmoveto{\pgfqpoint{1.948cm}{1.065cm}}
\pgfpathcurveto{\pgfqpoint{1.948cm}{1.101cm}}{\pgfqpoint{1.933cm}{1.136cm}}{\pgfqpoint{1.907cm}{1.162cm}}
\pgfpathcurveto{\pgfqpoint{1.882cm}{1.187cm}}{\pgfqpoint{1.847cm}{1.202cm}}{\pgfqpoint{1.811cm}{1.202cm}}
\pgfpathcurveto{\pgfqpoint{1.775cm}{1.202cm}}{\pgfqpoint{1.74cm}{1.187cm}}{\pgfqpoint{1.714cm}{1.162cm}}
\pgfpathcurveto{\pgfqpoint{1.689cm}{1.136cm}}{\pgfqpoint{1.674cm}{1.101cm}}{\pgfqpoint{1.674cm}{1.065cm}}
\pgfpathcurveto{\pgfqpoint{1.674cm}{1.029cm}}{\pgfqpoint{1.689cm}{0.994cm}}{\pgfqpoint{1.714cm}{0.968cm}}
\pgfpathcurveto{\pgfqpoint{1.74cm}{0.942cm}}{\pgfqpoint{1.775cm}{0.928cm}}{\pgfqpoint{1.811cm}{0.928cm}}
\pgfpathcurveto{\pgfqpoint{1.847cm}{0.928cm}}{\pgfqpoint{1.882cm}{0.942cm}}{\pgfqpoint{1.907cm}{0.968cm}}
\pgfpathcurveto{\pgfqpoint{1.933cm}{0.994cm}}{\pgfqpoint{1.948cm}{1.029cm}}{\pgfqpoint{1.948cm}{1.065cm}}
\pgfusepath{fill}
\begin{pgfscope}
\pgfsetdash{}{0cm}
\pgfsetlinewidth{0.818mm}
\pgfsetmiterlimit{7.0}
\pgfpathmoveto{\pgfqpoint{1.246cm}{0.315cm}}
\pgfpathlineto{\pgfqpoint{1.244cm}{1.061cm}}
\pgfusepath{stroke}
\end{pgfscope}
\pgfpathmoveto{\pgfqpoint{1.38cm}{1.065cm}}
\pgfpathcurveto{\pgfqpoint{1.38cm}{1.101cm}}{\pgfqpoint{1.366cm}{1.136cm}}{\pgfqpoint{1.34cm}{1.162cm}}
\pgfpathcurveto{\pgfqpoint{1.315cm}{1.187cm}}{\pgfqpoint{1.28cm}{1.202cm}}{\pgfqpoint{1.244cm}{1.202cm}}
\pgfpathcurveto{\pgfqpoint{1.207cm}{1.202cm}}{\pgfqpoint{1.173cm}{1.187cm}}{\pgfqpoint{1.147cm}{1.162cm}}
\pgfpathcurveto{\pgfqpoint{1.121cm}{1.136cm}}{\pgfqpoint{1.107cm}{1.101cm}}{\pgfqpoint{1.107cm}{1.065cm}}
\pgfpathcurveto{\pgfqpoint{1.107cm}{1.029cm}}{\pgfqpoint{1.121cm}{0.994cm}}{\pgfqpoint{1.147cm}{0.968cm}}
\pgfpathcurveto{\pgfqpoint{1.173cm}{0.942cm}}{\pgfqpoint{1.207cm}{0.928cm}}{\pgfqpoint{1.244cm}{0.928cm}}
\pgfpathcurveto{\pgfqpoint{1.28cm}{0.928cm}}{\pgfqpoint{1.315cm}{0.942cm}}{\pgfqpoint{1.34cm}{0.968cm}}
\pgfpathcurveto{\pgfqpoint{1.366cm}{0.994cm}}{\pgfqpoint{1.38cm}{1.029cm}}{\pgfqpoint{1.38cm}{1.065cm}}
\pgfusepath{fill}
\begin{pgfscope}
\pgfsetdash{}{0cm}
\pgfsetlinewidth{0.818mm}
\pgfsetmiterlimit{4.0}
\pgfpathmoveto{\pgfqpoint{1.383cm}{0.178cm}}
\pgfpathcurveto{\pgfqpoint{1.383cm}{0.214cm}}{\pgfqpoint{1.369cm}{0.249cm}}{\pgfqpoint{1.343cm}{0.275cm}}
\pgfpathcurveto{\pgfqpoint{1.317cm}{0.3cm}}{\pgfqpoint{1.283cm}{0.315cm}}{\pgfqpoint{1.246cm}{0.315cm}}
\pgfpathcurveto{\pgfqpoint{1.21cm}{0.315cm}}{\pgfqpoint{1.175cm}{0.3cm}}{\pgfqpoint{1.15cm}{0.275cm}}
\pgfpathcurveto{\pgfqpoint{1.124cm}{0.249cm}}{\pgfqpoint{1.11cm}{0.214cm}}{\pgfqpoint{1.11cm}{0.178cm}}
\pgfpathcurveto{\pgfqpoint{1.11cm}{0.141cm}}{\pgfqpoint{1.124cm}{0.107cm}}{\pgfqpoint{1.15cm}{0.081cm}}
\pgfpathcurveto{\pgfqpoint{1.175cm}{0.055cm}}{\pgfqpoint{1.21cm}{0.041cm}}{\pgfqpoint{1.246cm}{0.041cm}}
\pgfpathcurveto{\pgfqpoint{1.283cm}{0.041cm}}{\pgfqpoint{1.317cm}{0.055cm}}{\pgfqpoint{1.343cm}{0.081cm}}
\pgfpathcurveto{\pgfqpoint{1.369cm}{0.107cm}}{\pgfqpoint{1.383cm}{0.141cm}}{\pgfqpoint{1.383cm}{0.178cm}}
\pgfusepath{stroke}
\end{pgfscope}
\end{pgfscope}
\end{pgfscope}
\end{pgfscope}
\end{tikzpicture}}} \|_{\CC^{-\kappa}(\rho^\sigma)},
$$
$$
\|\UU_> X\|_{\CC^{-\frac{3}{2}+\alpha}(\rho^{-\alpha})}\lesssim 2^{-(1-\alpha-\kappa)\frac{2}{3}K} \|X\|_{\CC^{-\frac{1}{2}-\kappa}(\rho^\sigma)},
$$
$$
\|\UU_> X^{\!\resizebox{!}{.8em}{
\begin{tikzpicture}
\pgfpathmoveto{\pgfqpoint{0cm}{-0.035cm}}
\pgfpathlineto{\pgfqpoint{1.976cm}{-0.035cm}}
\pgfpathlineto{\pgfqpoint{1.976cm}{1.94cm}}
\pgfpathlineto{\pgfqpoint{0cm}{1.94cm}}
\pgfpathclose
\pgfusepath{clip}
\begin{pgfscope}
\begin{pgfscope}
\pgfpathmoveto{\pgfqpoint{0cm}{-0.035cm}}
\pgfpathlineto{\pgfqpoint{1.976cm}{-0.035cm}}
\pgfpathlineto{\pgfqpoint{1.976cm}{1.94cm}}
\pgfpathlineto{\pgfqpoint{0cm}{1.94cm}}
\pgfpathclose
\pgfusepath{clip}
\begin{pgfscope}
\begin{pgfscope}
\pgfsetdash{}{0cm}
\pgfsetlinewidth{0.818mm}
\pgfsetroundcap
\pgfsetroundjoin
\pgfsetmiterlimit{7.0}
\definecolor{eps2pgf_color}{gray}{0}\pgfsetstrokecolor{eps2pgf_color}\pgfsetfillcolor{eps2pgf_color}
\pgfpathmoveto{\pgfqpoint{0.117cm}{1.815cm}}
\pgfpathlineto{\pgfqpoint{0.682cm}{1.065cm}}
\pgfpathlineto{\pgfqpoint{1.246cm}{1.815cm}}
\pgfusepath{stroke}
\end{pgfscope}
\definecolor{eps2pgf_color}{gray}{0}\pgfsetstrokecolor{eps2pgf_color}\pgfsetfillcolor{eps2pgf_color}
\pgfpathmoveto{\pgfqpoint{0.273cm}{1.789cm}}
\pgfpathcurveto{\pgfqpoint{0.273cm}{1.825cm}}{\pgfqpoint{0.259cm}{1.86cm}}{\pgfqpoint{0.233cm}{1.886cm}}
\pgfpathcurveto{\pgfqpoint{0.207cm}{1.912cm}}{\pgfqpoint{0.173cm}{1.926cm}}{\pgfqpoint{0.137cm}{1.926cm}}
\pgfpathcurveto{\pgfqpoint{0.1cm}{1.926cm}}{\pgfqpoint{0.066cm}{1.912cm}}{\pgfqpoint{0.04cm}{1.886cm}}
\pgfpathcurveto{\pgfqpoint{0.014cm}{1.86cm}}{\pgfqpoint{0cm}{1.825cm}}{\pgfqpoint{0cm}{1.789cm}}
\pgfpathcurveto{\pgfqpoint{0cm}{1.753cm}}{\pgfqpoint{0.014cm}{1.718cm}}{\pgfqpoint{0.04cm}{1.692cm}}
\pgfpathcurveto{\pgfqpoint{0.066cm}{1.667cm}}{\pgfqpoint{0.1cm}{1.652cm}}{\pgfqpoint{0.137cm}{1.652cm}}
\pgfpathcurveto{\pgfqpoint{0.173cm}{1.652cm}}{\pgfqpoint{0.207cm}{1.667cm}}{\pgfqpoint{0.233cm}{1.692cm}}
\pgfpathcurveto{\pgfqpoint{0.259cm}{1.718cm}}{\pgfqpoint{0.273cm}{1.753cm}}{\pgfqpoint{0.273cm}{1.789cm}}
\pgfusepath{fill}
\begin{pgfscope}
\pgfsetdash{}{0cm}
\pgfsetlinewidth{0.818mm}
\pgfsetmiterlimit{7.0}
\pgfpathmoveto{\pgfqpoint{0.682cm}{1.065cm}}
\pgfpathlineto{\pgfqpoint{0.679cm}{1.812cm}}
\pgfusepath{stroke}
\end{pgfscope}
\pgfpathmoveto{\pgfqpoint{0.815cm}{1.793cm}}
\pgfpathcurveto{\pgfqpoint{0.815cm}{1.829cm}}{\pgfqpoint{0.801cm}{1.864cm}}{\pgfqpoint{0.775cm}{1.89cm}}
\pgfpathcurveto{\pgfqpoint{0.75cm}{1.915cm}}{\pgfqpoint{0.715cm}{1.93cm}}{\pgfqpoint{0.679cm}{1.93cm}}
\pgfpathcurveto{\pgfqpoint{0.643cm}{1.93cm}}{\pgfqpoint{0.608cm}{1.915cm}}{\pgfqpoint{0.582cm}{1.89cm}}
\pgfpathcurveto{\pgfqpoint{0.557cm}{1.864cm}}{\pgfqpoint{0.542cm}{1.829cm}}{\pgfqpoint{0.542cm}{1.793cm}}
\pgfpathcurveto{\pgfqpoint{0.542cm}{1.756cm}}{\pgfqpoint{0.557cm}{1.722cm}}{\pgfqpoint{0.582cm}{1.696cm}}
\pgfpathcurveto{\pgfqpoint{0.608cm}{1.67cm}}{\pgfqpoint{0.643cm}{1.656cm}}{\pgfqpoint{0.679cm}{1.656cm}}
\pgfpathcurveto{\pgfqpoint{0.715cm}{1.656cm}}{\pgfqpoint{0.75cm}{1.67cm}}{\pgfqpoint{0.775cm}{1.696cm}}
\pgfpathcurveto{\pgfqpoint{0.801cm}{1.722cm}}{\pgfqpoint{0.815cm}{1.756cm}}{\pgfqpoint{0.815cm}{1.793cm}}
\pgfusepath{fill}
\pgfpathmoveto{\pgfqpoint{1.345cm}{1.765cm}}
\pgfpathcurveto{\pgfqpoint{1.345cm}{1.801cm}}{\pgfqpoint{1.331cm}{1.836cm}}{\pgfqpoint{1.305cm}{1.862cm}}
\pgfpathcurveto{\pgfqpoint{1.28cm}{1.887cm}}{\pgfqpoint{1.245cm}{1.902cm}}{\pgfqpoint{1.209cm}{1.902cm}}
\pgfpathcurveto{\pgfqpoint{1.172cm}{1.902cm}}{\pgfqpoint{1.138cm}{1.887cm}}{\pgfqpoint{1.112cm}{1.862cm}}
\pgfpathcurveto{\pgfqpoint{1.087cm}{1.836cm}}{\pgfqpoint{1.072cm}{1.801cm}}{\pgfqpoint{1.072cm}{1.765cm}}
\pgfpathcurveto{\pgfqpoint{1.072cm}{1.728cm}}{\pgfqpoint{1.087cm}{1.694cm}}{\pgfqpoint{1.112cm}{1.668cm}}
\pgfpathcurveto{\pgfqpoint{1.138cm}{1.642cm}}{\pgfqpoint{1.172cm}{1.628cm}}{\pgfqpoint{1.209cm}{1.628cm}}
\pgfpathcurveto{\pgfqpoint{1.245cm}{1.628cm}}{\pgfqpoint{1.28cm}{1.642cm}}{\pgfqpoint{1.305cm}{1.668cm}}
\pgfpathcurveto{\pgfqpoint{1.331cm}{1.694cm}}{\pgfqpoint{1.345cm}{1.728cm}}{\pgfqpoint{1.345cm}{1.765cm}}
\pgfusepath{fill}
\begin{pgfscope}
\pgfsetdash{}{0cm}
\pgfsetlinewidth{0.818mm}
\pgfsetroundcap
\pgfsetroundjoin
\pgfsetmiterlimit{7.0}
\pgfpathmoveto{\pgfqpoint{0.682cm}{1.065cm}}
\pgfpathlineto{\pgfqpoint{1.246cm}{0.315cm}}
\pgfpathlineto{\pgfqpoint{1.811cm}{1.065cm}}
\pgfusepath{stroke}
\end{pgfscope}
\pgfpathmoveto{\pgfqpoint{1.948cm}{1.065cm}}
\pgfpathcurveto{\pgfqpoint{1.948cm}{1.101cm}}{\pgfqpoint{1.933cm}{1.136cm}}{\pgfqpoint{1.907cm}{1.162cm}}
\pgfpathcurveto{\pgfqpoint{1.882cm}{1.187cm}}{\pgfqpoint{1.847cm}{1.202cm}}{\pgfqpoint{1.811cm}{1.202cm}}
\pgfpathcurveto{\pgfqpoint{1.775cm}{1.202cm}}{\pgfqpoint{1.74cm}{1.187cm}}{\pgfqpoint{1.714cm}{1.162cm}}
\pgfpathcurveto{\pgfqpoint{1.689cm}{1.136cm}}{\pgfqpoint{1.674cm}{1.101cm}}{\pgfqpoint{1.674cm}{1.065cm}}
\pgfpathcurveto{\pgfqpoint{1.674cm}{1.029cm}}{\pgfqpoint{1.689cm}{0.994cm}}{\pgfqpoint{1.714cm}{0.968cm}}
\pgfpathcurveto{\pgfqpoint{1.74cm}{0.942cm}}{\pgfqpoint{1.775cm}{0.928cm}}{\pgfqpoint{1.811cm}{0.928cm}}
\pgfpathcurveto{\pgfqpoint{1.847cm}{0.928cm}}{\pgfqpoint{1.882cm}{0.942cm}}{\pgfqpoint{1.907cm}{0.968cm}}
\pgfpathcurveto{\pgfqpoint{1.933cm}{0.994cm}}{\pgfqpoint{1.948cm}{1.029cm}}{\pgfqpoint{1.948cm}{1.065cm}}
\pgfusepath{fill}
\begin{pgfscope}
\pgfsetdash{}{0cm}
\pgfsetlinewidth{0.818mm}
\pgfsetmiterlimit{4.0}
\pgfpathmoveto{\pgfqpoint{1.383cm}{0.178cm}}
\pgfpathcurveto{\pgfqpoint{1.383cm}{0.214cm}}{\pgfqpoint{1.369cm}{0.249cm}}{\pgfqpoint{1.343cm}{0.275cm}}
\pgfpathcurveto{\pgfqpoint{1.317cm}{0.3cm}}{\pgfqpoint{1.283cm}{0.315cm}}{\pgfqpoint{1.246cm}{0.315cm}}
\pgfpathcurveto{\pgfqpoint{1.21cm}{0.315cm}}{\pgfqpoint{1.175cm}{0.3cm}}{\pgfqpoint{1.15cm}{0.275cm}}
\pgfpathcurveto{\pgfqpoint{1.124cm}{0.249cm}}{\pgfqpoint{1.11cm}{0.214cm}}{\pgfqpoint{1.11cm}{0.178cm}}
\pgfpathcurveto{\pgfqpoint{1.11cm}{0.141cm}}{\pgfqpoint{1.124cm}{0.107cm}}{\pgfqpoint{1.15cm}{0.081cm}}
\pgfpathcurveto{\pgfqpoint{1.175cm}{0.055cm}}{\pgfqpoint{1.21cm}{0.041cm}}{\pgfqpoint{1.246cm}{0.041cm}}
\pgfpathcurveto{\pgfqpoint{1.283cm}{0.041cm}}{\pgfqpoint{1.317cm}{0.055cm}}{\pgfqpoint{1.343cm}{0.081cm}}
\pgfpathcurveto{\pgfqpoint{1.369cm}{0.107cm}}{\pgfqpoint{1.383cm}{0.141cm}}{\pgfqpoint{1.383cm}{0.178cm}}
\pgfusepath{stroke}
\end{pgfscope}
\end{pgfscope}
\end{pgfscope}
\end{pgfscope}
\end{tikzpicture}}}\|_{\CC^{-\frac{3}{2}+\alpha}}\lesssim 2^{-(\frac{3}{2}-\alpha-\kappa)K/2} \|X^{\!\resizebox{!}{.8em}{
\begin{tikzpicture}
\pgfpathmoveto{\pgfqpoint{0cm}{-0.035cm}}
\pgfpathlineto{\pgfqpoint{1.976cm}{-0.035cm}}
\pgfpathlineto{\pgfqpoint{1.976cm}{1.94cm}}
\pgfpathlineto{\pgfqpoint{0cm}{1.94cm}}
\pgfpathclose
\pgfusepath{clip}
\begin{pgfscope}
\begin{pgfscope}
\pgfpathmoveto{\pgfqpoint{0cm}{-0.035cm}}
\pgfpathlineto{\pgfqpoint{1.976cm}{-0.035cm}}
\pgfpathlineto{\pgfqpoint{1.976cm}{1.94cm}}
\pgfpathlineto{\pgfqpoint{0cm}{1.94cm}}
\pgfpathclose
\pgfusepath{clip}
\begin{pgfscope}
\begin{pgfscope}
\pgfsetdash{}{0cm}
\pgfsetlinewidth{0.818mm}
\pgfsetroundcap
\pgfsetroundjoin
\pgfsetmiterlimit{7.0}
\definecolor{eps2pgf_color}{gray}{0}\pgfsetstrokecolor{eps2pgf_color}\pgfsetfillcolor{eps2pgf_color}
\pgfpathmoveto{\pgfqpoint{0.117cm}{1.815cm}}
\pgfpathlineto{\pgfqpoint{0.682cm}{1.065cm}}
\pgfpathlineto{\pgfqpoint{1.246cm}{1.815cm}}
\pgfusepath{stroke}
\end{pgfscope}
\definecolor{eps2pgf_color}{gray}{0}\pgfsetstrokecolor{eps2pgf_color}\pgfsetfillcolor{eps2pgf_color}
\pgfpathmoveto{\pgfqpoint{0.273cm}{1.789cm}}
\pgfpathcurveto{\pgfqpoint{0.273cm}{1.825cm}}{\pgfqpoint{0.259cm}{1.86cm}}{\pgfqpoint{0.233cm}{1.886cm}}
\pgfpathcurveto{\pgfqpoint{0.207cm}{1.912cm}}{\pgfqpoint{0.173cm}{1.926cm}}{\pgfqpoint{0.137cm}{1.926cm}}
\pgfpathcurveto{\pgfqpoint{0.1cm}{1.926cm}}{\pgfqpoint{0.066cm}{1.912cm}}{\pgfqpoint{0.04cm}{1.886cm}}
\pgfpathcurveto{\pgfqpoint{0.014cm}{1.86cm}}{\pgfqpoint{0cm}{1.825cm}}{\pgfqpoint{0cm}{1.789cm}}
\pgfpathcurveto{\pgfqpoint{0cm}{1.753cm}}{\pgfqpoint{0.014cm}{1.718cm}}{\pgfqpoint{0.04cm}{1.692cm}}
\pgfpathcurveto{\pgfqpoint{0.066cm}{1.667cm}}{\pgfqpoint{0.1cm}{1.652cm}}{\pgfqpoint{0.137cm}{1.652cm}}
\pgfpathcurveto{\pgfqpoint{0.173cm}{1.652cm}}{\pgfqpoint{0.207cm}{1.667cm}}{\pgfqpoint{0.233cm}{1.692cm}}
\pgfpathcurveto{\pgfqpoint{0.259cm}{1.718cm}}{\pgfqpoint{0.273cm}{1.753cm}}{\pgfqpoint{0.273cm}{1.789cm}}
\pgfusepath{fill}
\begin{pgfscope}
\pgfsetdash{}{0cm}
\pgfsetlinewidth{0.818mm}
\pgfsetmiterlimit{7.0}
\pgfpathmoveto{\pgfqpoint{0.682cm}{1.065cm}}
\pgfpathlineto{\pgfqpoint{0.679cm}{1.812cm}}
\pgfusepath{stroke}
\end{pgfscope}
\pgfpathmoveto{\pgfqpoint{0.815cm}{1.793cm}}
\pgfpathcurveto{\pgfqpoint{0.815cm}{1.829cm}}{\pgfqpoint{0.801cm}{1.864cm}}{\pgfqpoint{0.775cm}{1.89cm}}
\pgfpathcurveto{\pgfqpoint{0.75cm}{1.915cm}}{\pgfqpoint{0.715cm}{1.93cm}}{\pgfqpoint{0.679cm}{1.93cm}}
\pgfpathcurveto{\pgfqpoint{0.643cm}{1.93cm}}{\pgfqpoint{0.608cm}{1.915cm}}{\pgfqpoint{0.582cm}{1.89cm}}
\pgfpathcurveto{\pgfqpoint{0.557cm}{1.864cm}}{\pgfqpoint{0.542cm}{1.829cm}}{\pgfqpoint{0.542cm}{1.793cm}}
\pgfpathcurveto{\pgfqpoint{0.542cm}{1.756cm}}{\pgfqpoint{0.557cm}{1.722cm}}{\pgfqpoint{0.582cm}{1.696cm}}
\pgfpathcurveto{\pgfqpoint{0.608cm}{1.67cm}}{\pgfqpoint{0.643cm}{1.656cm}}{\pgfqpoint{0.679cm}{1.656cm}}
\pgfpathcurveto{\pgfqpoint{0.715cm}{1.656cm}}{\pgfqpoint{0.75cm}{1.67cm}}{\pgfqpoint{0.775cm}{1.696cm}}
\pgfpathcurveto{\pgfqpoint{0.801cm}{1.722cm}}{\pgfqpoint{0.815cm}{1.756cm}}{\pgfqpoint{0.815cm}{1.793cm}}
\pgfusepath{fill}
\pgfpathmoveto{\pgfqpoint{1.345cm}{1.765cm}}
\pgfpathcurveto{\pgfqpoint{1.345cm}{1.801cm}}{\pgfqpoint{1.331cm}{1.836cm}}{\pgfqpoint{1.305cm}{1.862cm}}
\pgfpathcurveto{\pgfqpoint{1.28cm}{1.887cm}}{\pgfqpoint{1.245cm}{1.902cm}}{\pgfqpoint{1.209cm}{1.902cm}}
\pgfpathcurveto{\pgfqpoint{1.172cm}{1.902cm}}{\pgfqpoint{1.138cm}{1.887cm}}{\pgfqpoint{1.112cm}{1.862cm}}
\pgfpathcurveto{\pgfqpoint{1.087cm}{1.836cm}}{\pgfqpoint{1.072cm}{1.801cm}}{\pgfqpoint{1.072cm}{1.765cm}}
\pgfpathcurveto{\pgfqpoint{1.072cm}{1.728cm}}{\pgfqpoint{1.087cm}{1.694cm}}{\pgfqpoint{1.112cm}{1.668cm}}
\pgfpathcurveto{\pgfqpoint{1.138cm}{1.642cm}}{\pgfqpoint{1.172cm}{1.628cm}}{\pgfqpoint{1.209cm}{1.628cm}}
\pgfpathcurveto{\pgfqpoint{1.245cm}{1.628cm}}{\pgfqpoint{1.28cm}{1.642cm}}{\pgfqpoint{1.305cm}{1.668cm}}
\pgfpathcurveto{\pgfqpoint{1.331cm}{1.694cm}}{\pgfqpoint{1.345cm}{1.728cm}}{\pgfqpoint{1.345cm}{1.765cm}}
\pgfusepath{fill}
\begin{pgfscope}
\pgfsetdash{}{0cm}
\pgfsetlinewidth{0.818mm}
\pgfsetroundcap
\pgfsetroundjoin
\pgfsetmiterlimit{7.0}
\pgfpathmoveto{\pgfqpoint{0.682cm}{1.065cm}}
\pgfpathlineto{\pgfqpoint{1.246cm}{0.315cm}}
\pgfpathlineto{\pgfqpoint{1.811cm}{1.065cm}}
\pgfusepath{stroke}
\end{pgfscope}
\pgfpathmoveto{\pgfqpoint{1.948cm}{1.065cm}}
\pgfpathcurveto{\pgfqpoint{1.948cm}{1.101cm}}{\pgfqpoint{1.933cm}{1.136cm}}{\pgfqpoint{1.907cm}{1.162cm}}
\pgfpathcurveto{\pgfqpoint{1.882cm}{1.187cm}}{\pgfqpoint{1.847cm}{1.202cm}}{\pgfqpoint{1.811cm}{1.202cm}}
\pgfpathcurveto{\pgfqpoint{1.775cm}{1.202cm}}{\pgfqpoint{1.74cm}{1.187cm}}{\pgfqpoint{1.714cm}{1.162cm}}
\pgfpathcurveto{\pgfqpoint{1.689cm}{1.136cm}}{\pgfqpoint{1.674cm}{1.101cm}}{\pgfqpoint{1.674cm}{1.065cm}}
\pgfpathcurveto{\pgfqpoint{1.674cm}{1.029cm}}{\pgfqpoint{1.689cm}{0.994cm}}{\pgfqpoint{1.714cm}{0.968cm}}
\pgfpathcurveto{\pgfqpoint{1.74cm}{0.942cm}}{\pgfqpoint{1.775cm}{0.928cm}}{\pgfqpoint{1.811cm}{0.928cm}}
\pgfpathcurveto{\pgfqpoint{1.847cm}{0.928cm}}{\pgfqpoint{1.882cm}{0.942cm}}{\pgfqpoint{1.907cm}{0.968cm}}
\pgfpathcurveto{\pgfqpoint{1.933cm}{0.994cm}}{\pgfqpoint{1.948cm}{1.029cm}}{\pgfqpoint{1.948cm}{1.065cm}}
\pgfusepath{fill}
\begin{pgfscope}
\pgfsetdash{}{0cm}
\pgfsetlinewidth{0.818mm}
\pgfsetmiterlimit{4.0}
\pgfpathmoveto{\pgfqpoint{1.383cm}{0.178cm}}
\pgfpathcurveto{\pgfqpoint{1.383cm}{0.214cm}}{\pgfqpoint{1.369cm}{0.249cm}}{\pgfqpoint{1.343cm}{0.275cm}}
\pgfpathcurveto{\pgfqpoint{1.317cm}{0.3cm}}{\pgfqpoint{1.283cm}{0.315cm}}{\pgfqpoint{1.246cm}{0.315cm}}
\pgfpathcurveto{\pgfqpoint{1.21cm}{0.315cm}}{\pgfqpoint{1.175cm}{0.3cm}}{\pgfqpoint{1.15cm}{0.275cm}}
\pgfpathcurveto{\pgfqpoint{1.124cm}{0.249cm}}{\pgfqpoint{1.11cm}{0.214cm}}{\pgfqpoint{1.11cm}{0.178cm}}
\pgfpathcurveto{\pgfqpoint{1.11cm}{0.141cm}}{\pgfqpoint{1.124cm}{0.107cm}}{\pgfqpoint{1.15cm}{0.081cm}}
\pgfpathcurveto{\pgfqpoint{1.175cm}{0.055cm}}{\pgfqpoint{1.21cm}{0.041cm}}{\pgfqpoint{1.246cm}{0.041cm}}
\pgfpathcurveto{\pgfqpoint{1.283cm}{0.041cm}}{\pgfqpoint{1.317cm}{0.055cm}}{\pgfqpoint{1.343cm}{0.081cm}}
\pgfpathcurveto{\pgfqpoint{1.369cm}{0.107cm}}{\pgfqpoint{1.383cm}{0.141cm}}{\pgfqpoint{1.383cm}{0.178cm}}
\pgfusepath{stroke}
\end{pgfscope}
\end{pgfscope}
\end{pgfscope}
\end{pgfscope}
\end{tikzpicture}}} \|_{\CC^{-\kappa}(\rho^\sigma)},
$$
$$
\|\UU_> X\|_{\CC^{-\frac{3}{2}+\alpha}(\rho^{-\frac{1}{2}+\alpha})}\lesssim 2^{-(1-\alpha-\kappa)\frac{4}{3}K} \|X\|_{\CC^{-\frac{1}{2}-\kappa}(\rho^\sigma)},
$$
which implies
\begin{align*}
&\|3\UU_>\llbracket X^2 \rrbracket\succ(\phi+\psi)\|_{\CC^{-\frac{3}{2}+\alpha}(\rho)}+\|3\UU_>\llbracket X^2 \rrbracket\prec(\phi+\psi)\|_{\CC^{-\frac{3}{2}+\alpha}(\rho)}\\
&\quad\lesssim \|\phi+\psi\|_{L^\infty(\rho)}\|\UU_>\llbracket X^2 \rrbracket\|_{\CC^{-\frac{3}{2}+\alpha}}\lesssim 2^{-(\frac{1}{2}-\alpha-\kappa)K} \|\phi+\psi\|_{L^\infty(\rho)}\\
&\quad\lesssim 2^{-(\frac{1}{2}-\alpha-\kappa)K}2^{(1-\alpha-\kappa)K}\lesssim 1+\|\psi\|_{L^\infty(\rho)}^{\varepsilon},
\end{align*}
\begin{align*}
\|3( \phi + \psi)\prec\UU_>X^{\!\resizebox{!}{.8em}{
\begin{tikzpicture}
\pgfpathmoveto{\pgfqpoint{0cm}{-0.035cm}}
\pgfpathlineto{\pgfqpoint{1.976cm}{-0.035cm}}
\pgfpathlineto{\pgfqpoint{1.976cm}{1.94cm}}
\pgfpathlineto{\pgfqpoint{0cm}{1.94cm}}
\pgfpathclose
\pgfusepath{clip}
\begin{pgfscope}
\begin{pgfscope}
\pgfpathmoveto{\pgfqpoint{0cm}{-0.035cm}}
\pgfpathlineto{\pgfqpoint{1.976cm}{-0.035cm}}
\pgfpathlineto{\pgfqpoint{1.976cm}{1.94cm}}
\pgfpathlineto{\pgfqpoint{0cm}{1.94cm}}
\pgfpathclose
\pgfusepath{clip}
\begin{pgfscope}
\begin{pgfscope}
\pgfsetdash{}{0cm}
\pgfsetlinewidth{0.818mm}
\pgfsetroundcap
\pgfsetroundjoin
\pgfsetmiterlimit{7.0}
\definecolor{eps2pgf_color}{gray}{0}\pgfsetstrokecolor{eps2pgf_color}\pgfsetfillcolor{eps2pgf_color}
\pgfpathmoveto{\pgfqpoint{0.117cm}{1.815cm}}
\pgfpathlineto{\pgfqpoint{0.682cm}{1.065cm}}
\pgfpathlineto{\pgfqpoint{1.246cm}{1.815cm}}
\pgfusepath{stroke}
\end{pgfscope}
\definecolor{eps2pgf_color}{gray}{0}\pgfsetstrokecolor{eps2pgf_color}\pgfsetfillcolor{eps2pgf_color}
\pgfpathmoveto{\pgfqpoint{0.273cm}{1.789cm}}
\pgfpathcurveto{\pgfqpoint{0.273cm}{1.825cm}}{\pgfqpoint{0.259cm}{1.86cm}}{\pgfqpoint{0.233cm}{1.886cm}}
\pgfpathcurveto{\pgfqpoint{0.207cm}{1.912cm}}{\pgfqpoint{0.173cm}{1.926cm}}{\pgfqpoint{0.137cm}{1.926cm}}
\pgfpathcurveto{\pgfqpoint{0.1cm}{1.926cm}}{\pgfqpoint{0.066cm}{1.912cm}}{\pgfqpoint{0.04cm}{1.886cm}}
\pgfpathcurveto{\pgfqpoint{0.014cm}{1.86cm}}{\pgfqpoint{0cm}{1.825cm}}{\pgfqpoint{0cm}{1.789cm}}
\pgfpathcurveto{\pgfqpoint{0cm}{1.753cm}}{\pgfqpoint{0.014cm}{1.718cm}}{\pgfqpoint{0.04cm}{1.692cm}}
\pgfpathcurveto{\pgfqpoint{0.066cm}{1.667cm}}{\pgfqpoint{0.1cm}{1.652cm}}{\pgfqpoint{0.137cm}{1.652cm}}
\pgfpathcurveto{\pgfqpoint{0.173cm}{1.652cm}}{\pgfqpoint{0.207cm}{1.667cm}}{\pgfqpoint{0.233cm}{1.692cm}}
\pgfpathcurveto{\pgfqpoint{0.259cm}{1.718cm}}{\pgfqpoint{0.273cm}{1.753cm}}{\pgfqpoint{0.273cm}{1.789cm}}
\pgfusepath{fill}
\pgfpathmoveto{\pgfqpoint{1.345cm}{1.765cm}}
\pgfpathcurveto{\pgfqpoint{1.345cm}{1.801cm}}{\pgfqpoint{1.331cm}{1.836cm}}{\pgfqpoint{1.305cm}{1.862cm}}
\pgfpathcurveto{\pgfqpoint{1.28cm}{1.887cm}}{\pgfqpoint{1.245cm}{1.902cm}}{\pgfqpoint{1.209cm}{1.902cm}}
\pgfpathcurveto{\pgfqpoint{1.172cm}{1.902cm}}{\pgfqpoint{1.138cm}{1.887cm}}{\pgfqpoint{1.112cm}{1.862cm}}
\pgfpathcurveto{\pgfqpoint{1.087cm}{1.836cm}}{\pgfqpoint{1.072cm}{1.801cm}}{\pgfqpoint{1.072cm}{1.765cm}}
\pgfpathcurveto{\pgfqpoint{1.072cm}{1.728cm}}{\pgfqpoint{1.087cm}{1.694cm}}{\pgfqpoint{1.112cm}{1.668cm}}
\pgfpathcurveto{\pgfqpoint{1.138cm}{1.642cm}}{\pgfqpoint{1.172cm}{1.628cm}}{\pgfqpoint{1.209cm}{1.628cm}}
\pgfpathcurveto{\pgfqpoint{1.245cm}{1.628cm}}{\pgfqpoint{1.28cm}{1.642cm}}{\pgfqpoint{1.305cm}{1.668cm}}
\pgfpathcurveto{\pgfqpoint{1.331cm}{1.694cm}}{\pgfqpoint{1.345cm}{1.728cm}}{\pgfqpoint{1.345cm}{1.765cm}}
\pgfusepath{fill}
\begin{pgfscope}
\pgfsetdash{}{0cm}
\pgfsetlinewidth{0.818mm}
\pgfsetroundcap
\pgfsetroundjoin
\pgfsetmiterlimit{7.0}
\pgfpathmoveto{\pgfqpoint{0.682cm}{1.065cm}}
\pgfpathlineto{\pgfqpoint{1.246cm}{0.315cm}}
\pgfpathlineto{\pgfqpoint{1.811cm}{1.065cm}}
\pgfusepath{stroke}
\end{pgfscope}
\pgfpathmoveto{\pgfqpoint{1.948cm}{1.065cm}}
\pgfpathcurveto{\pgfqpoint{1.948cm}{1.101cm}}{\pgfqpoint{1.933cm}{1.136cm}}{\pgfqpoint{1.907cm}{1.162cm}}
\pgfpathcurveto{\pgfqpoint{1.882cm}{1.187cm}}{\pgfqpoint{1.847cm}{1.202cm}}{\pgfqpoint{1.811cm}{1.202cm}}
\pgfpathcurveto{\pgfqpoint{1.775cm}{1.202cm}}{\pgfqpoint{1.74cm}{1.187cm}}{\pgfqpoint{1.714cm}{1.162cm}}
\pgfpathcurveto{\pgfqpoint{1.689cm}{1.136cm}}{\pgfqpoint{1.674cm}{1.101cm}}{\pgfqpoint{1.674cm}{1.065cm}}
\pgfpathcurveto{\pgfqpoint{1.674cm}{1.029cm}}{\pgfqpoint{1.689cm}{0.994cm}}{\pgfqpoint{1.714cm}{0.968cm}}
\pgfpathcurveto{\pgfqpoint{1.74cm}{0.942cm}}{\pgfqpoint{1.775cm}{0.928cm}}{\pgfqpoint{1.811cm}{0.928cm}}
\pgfpathcurveto{\pgfqpoint{1.847cm}{0.928cm}}{\pgfqpoint{1.882cm}{0.942cm}}{\pgfqpoint{1.907cm}{0.968cm}}
\pgfpathcurveto{\pgfqpoint{1.933cm}{0.994cm}}{\pgfqpoint{1.948cm}{1.029cm}}{\pgfqpoint{1.948cm}{1.065cm}}
\pgfusepath{fill}
\begin{pgfscope}
\pgfsetdash{}{0cm}
\pgfsetlinewidth{0.818mm}
\pgfsetmiterlimit{7.0}
\pgfpathmoveto{\pgfqpoint{1.246cm}{0.315cm}}
\pgfpathlineto{\pgfqpoint{1.244cm}{1.061cm}}
\pgfusepath{stroke}
\end{pgfscope}
\pgfpathmoveto{\pgfqpoint{1.38cm}{1.065cm}}
\pgfpathcurveto{\pgfqpoint{1.38cm}{1.101cm}}{\pgfqpoint{1.366cm}{1.136cm}}{\pgfqpoint{1.34cm}{1.162cm}}
\pgfpathcurveto{\pgfqpoint{1.315cm}{1.187cm}}{\pgfqpoint{1.28cm}{1.202cm}}{\pgfqpoint{1.244cm}{1.202cm}}
\pgfpathcurveto{\pgfqpoint{1.207cm}{1.202cm}}{\pgfqpoint{1.173cm}{1.187cm}}{\pgfqpoint{1.147cm}{1.162cm}}
\pgfpathcurveto{\pgfqpoint{1.121cm}{1.136cm}}{\pgfqpoint{1.107cm}{1.101cm}}{\pgfqpoint{1.107cm}{1.065cm}}
\pgfpathcurveto{\pgfqpoint{1.107cm}{1.029cm}}{\pgfqpoint{1.121cm}{0.994cm}}{\pgfqpoint{1.147cm}{0.968cm}}
\pgfpathcurveto{\pgfqpoint{1.173cm}{0.942cm}}{\pgfqpoint{1.207cm}{0.928cm}}{\pgfqpoint{1.244cm}{0.928cm}}
\pgfpathcurveto{\pgfqpoint{1.28cm}{0.928cm}}{\pgfqpoint{1.315cm}{0.942cm}}{\pgfqpoint{1.34cm}{0.968cm}}
\pgfpathcurveto{\pgfqpoint{1.366cm}{0.994cm}}{\pgfqpoint{1.38cm}{1.029cm}}{\pgfqpoint{1.38cm}{1.065cm}}
\pgfusepath{fill}
\begin{pgfscope}
\pgfsetdash{}{0cm}
\pgfsetlinewidth{0.818mm}
\pgfsetmiterlimit{4.0}
\pgfpathmoveto{\pgfqpoint{1.383cm}{0.178cm}}
\pgfpathcurveto{\pgfqpoint{1.383cm}{0.214cm}}{\pgfqpoint{1.369cm}{0.249cm}}{\pgfqpoint{1.343cm}{0.275cm}}
\pgfpathcurveto{\pgfqpoint{1.317cm}{0.3cm}}{\pgfqpoint{1.283cm}{0.315cm}}{\pgfqpoint{1.246cm}{0.315cm}}
\pgfpathcurveto{\pgfqpoint{1.21cm}{0.315cm}}{\pgfqpoint{1.175cm}{0.3cm}}{\pgfqpoint{1.15cm}{0.275cm}}
\pgfpathcurveto{\pgfqpoint{1.124cm}{0.249cm}}{\pgfqpoint{1.11cm}{0.214cm}}{\pgfqpoint{1.11cm}{0.178cm}}
\pgfpathcurveto{\pgfqpoint{1.11cm}{0.141cm}}{\pgfqpoint{1.124cm}{0.107cm}}{\pgfqpoint{1.15cm}{0.081cm}}
\pgfpathcurveto{\pgfqpoint{1.175cm}{0.055cm}}{\pgfqpoint{1.21cm}{0.041cm}}{\pgfqpoint{1.246cm}{0.041cm}}
\pgfpathcurveto{\pgfqpoint{1.283cm}{0.041cm}}{\pgfqpoint{1.317cm}{0.055cm}}{\pgfqpoint{1.343cm}{0.081cm}}
\pgfpathcurveto{\pgfqpoint{1.369cm}{0.107cm}}{\pgfqpoint{1.383cm}{0.141cm}}{\pgfqpoint{1.383cm}{0.178cm}}
\pgfusepath{stroke}
\end{pgfscope}
\end{pgfscope}
\end{pgfscope}
\end{pgfscope}
\end{tikzpicture}}}\|_{\CC^{-\frac{3}{2}+\alpha}(\rho)}
&\lesssim  \|\phi+\psi\|_{L^\infty(\rho)}\|\UU_>X^{\!\resizebox{!}{.8em}{
\begin{tikzpicture}
\pgfpathmoveto{\pgfqpoint{0cm}{-0.035cm}}
\pgfpathlineto{\pgfqpoint{1.976cm}{-0.035cm}}
\pgfpathlineto{\pgfqpoint{1.976cm}{1.94cm}}
\pgfpathlineto{\pgfqpoint{0cm}{1.94cm}}
\pgfpathclose
\pgfusepath{clip}
\begin{pgfscope}
\begin{pgfscope}
\pgfpathmoveto{\pgfqpoint{0cm}{-0.035cm}}
\pgfpathlineto{\pgfqpoint{1.976cm}{-0.035cm}}
\pgfpathlineto{\pgfqpoint{1.976cm}{1.94cm}}
\pgfpathlineto{\pgfqpoint{0cm}{1.94cm}}
\pgfpathclose
\pgfusepath{clip}
\begin{pgfscope}
\begin{pgfscope}
\pgfsetdash{}{0cm}
\pgfsetlinewidth{0.818mm}
\pgfsetroundcap
\pgfsetroundjoin
\pgfsetmiterlimit{7.0}
\definecolor{eps2pgf_color}{gray}{0}\pgfsetstrokecolor{eps2pgf_color}\pgfsetfillcolor{eps2pgf_color}
\pgfpathmoveto{\pgfqpoint{0.117cm}{1.815cm}}
\pgfpathlineto{\pgfqpoint{0.682cm}{1.065cm}}
\pgfpathlineto{\pgfqpoint{1.246cm}{1.815cm}}
\pgfusepath{stroke}
\end{pgfscope}
\definecolor{eps2pgf_color}{gray}{0}\pgfsetstrokecolor{eps2pgf_color}\pgfsetfillcolor{eps2pgf_color}
\pgfpathmoveto{\pgfqpoint{0.273cm}{1.789cm}}
\pgfpathcurveto{\pgfqpoint{0.273cm}{1.825cm}}{\pgfqpoint{0.259cm}{1.86cm}}{\pgfqpoint{0.233cm}{1.886cm}}
\pgfpathcurveto{\pgfqpoint{0.207cm}{1.912cm}}{\pgfqpoint{0.173cm}{1.926cm}}{\pgfqpoint{0.137cm}{1.926cm}}
\pgfpathcurveto{\pgfqpoint{0.1cm}{1.926cm}}{\pgfqpoint{0.066cm}{1.912cm}}{\pgfqpoint{0.04cm}{1.886cm}}
\pgfpathcurveto{\pgfqpoint{0.014cm}{1.86cm}}{\pgfqpoint{0cm}{1.825cm}}{\pgfqpoint{0cm}{1.789cm}}
\pgfpathcurveto{\pgfqpoint{0cm}{1.753cm}}{\pgfqpoint{0.014cm}{1.718cm}}{\pgfqpoint{0.04cm}{1.692cm}}
\pgfpathcurveto{\pgfqpoint{0.066cm}{1.667cm}}{\pgfqpoint{0.1cm}{1.652cm}}{\pgfqpoint{0.137cm}{1.652cm}}
\pgfpathcurveto{\pgfqpoint{0.173cm}{1.652cm}}{\pgfqpoint{0.207cm}{1.667cm}}{\pgfqpoint{0.233cm}{1.692cm}}
\pgfpathcurveto{\pgfqpoint{0.259cm}{1.718cm}}{\pgfqpoint{0.273cm}{1.753cm}}{\pgfqpoint{0.273cm}{1.789cm}}
\pgfusepath{fill}
\pgfpathmoveto{\pgfqpoint{1.345cm}{1.765cm}}
\pgfpathcurveto{\pgfqpoint{1.345cm}{1.801cm}}{\pgfqpoint{1.331cm}{1.836cm}}{\pgfqpoint{1.305cm}{1.862cm}}
\pgfpathcurveto{\pgfqpoint{1.28cm}{1.887cm}}{\pgfqpoint{1.245cm}{1.902cm}}{\pgfqpoint{1.209cm}{1.902cm}}
\pgfpathcurveto{\pgfqpoint{1.172cm}{1.902cm}}{\pgfqpoint{1.138cm}{1.887cm}}{\pgfqpoint{1.112cm}{1.862cm}}
\pgfpathcurveto{\pgfqpoint{1.087cm}{1.836cm}}{\pgfqpoint{1.072cm}{1.801cm}}{\pgfqpoint{1.072cm}{1.765cm}}
\pgfpathcurveto{\pgfqpoint{1.072cm}{1.728cm}}{\pgfqpoint{1.087cm}{1.694cm}}{\pgfqpoint{1.112cm}{1.668cm}}
\pgfpathcurveto{\pgfqpoint{1.138cm}{1.642cm}}{\pgfqpoint{1.172cm}{1.628cm}}{\pgfqpoint{1.209cm}{1.628cm}}
\pgfpathcurveto{\pgfqpoint{1.245cm}{1.628cm}}{\pgfqpoint{1.28cm}{1.642cm}}{\pgfqpoint{1.305cm}{1.668cm}}
\pgfpathcurveto{\pgfqpoint{1.331cm}{1.694cm}}{\pgfqpoint{1.345cm}{1.728cm}}{\pgfqpoint{1.345cm}{1.765cm}}
\pgfusepath{fill}
\begin{pgfscope}
\pgfsetdash{}{0cm}
\pgfsetlinewidth{0.818mm}
\pgfsetroundcap
\pgfsetroundjoin
\pgfsetmiterlimit{7.0}
\pgfpathmoveto{\pgfqpoint{0.682cm}{1.065cm}}
\pgfpathlineto{\pgfqpoint{1.246cm}{0.315cm}}
\pgfpathlineto{\pgfqpoint{1.811cm}{1.065cm}}
\pgfusepath{stroke}
\end{pgfscope}
\pgfpathmoveto{\pgfqpoint{1.948cm}{1.065cm}}
\pgfpathcurveto{\pgfqpoint{1.948cm}{1.101cm}}{\pgfqpoint{1.933cm}{1.136cm}}{\pgfqpoint{1.907cm}{1.162cm}}
\pgfpathcurveto{\pgfqpoint{1.882cm}{1.187cm}}{\pgfqpoint{1.847cm}{1.202cm}}{\pgfqpoint{1.811cm}{1.202cm}}
\pgfpathcurveto{\pgfqpoint{1.775cm}{1.202cm}}{\pgfqpoint{1.74cm}{1.187cm}}{\pgfqpoint{1.714cm}{1.162cm}}
\pgfpathcurveto{\pgfqpoint{1.689cm}{1.136cm}}{\pgfqpoint{1.674cm}{1.101cm}}{\pgfqpoint{1.674cm}{1.065cm}}
\pgfpathcurveto{\pgfqpoint{1.674cm}{1.029cm}}{\pgfqpoint{1.689cm}{0.994cm}}{\pgfqpoint{1.714cm}{0.968cm}}
\pgfpathcurveto{\pgfqpoint{1.74cm}{0.942cm}}{\pgfqpoint{1.775cm}{0.928cm}}{\pgfqpoint{1.811cm}{0.928cm}}
\pgfpathcurveto{\pgfqpoint{1.847cm}{0.928cm}}{\pgfqpoint{1.882cm}{0.942cm}}{\pgfqpoint{1.907cm}{0.968cm}}
\pgfpathcurveto{\pgfqpoint{1.933cm}{0.994cm}}{\pgfqpoint{1.948cm}{1.029cm}}{\pgfqpoint{1.948cm}{1.065cm}}
\pgfusepath{fill}
\begin{pgfscope}
\pgfsetdash{}{0cm}
\pgfsetlinewidth{0.818mm}
\pgfsetmiterlimit{7.0}
\pgfpathmoveto{\pgfqpoint{1.246cm}{0.315cm}}
\pgfpathlineto{\pgfqpoint{1.244cm}{1.061cm}}
\pgfusepath{stroke}
\end{pgfscope}
\pgfpathmoveto{\pgfqpoint{1.38cm}{1.065cm}}
\pgfpathcurveto{\pgfqpoint{1.38cm}{1.101cm}}{\pgfqpoint{1.366cm}{1.136cm}}{\pgfqpoint{1.34cm}{1.162cm}}
\pgfpathcurveto{\pgfqpoint{1.315cm}{1.187cm}}{\pgfqpoint{1.28cm}{1.202cm}}{\pgfqpoint{1.244cm}{1.202cm}}
\pgfpathcurveto{\pgfqpoint{1.207cm}{1.202cm}}{\pgfqpoint{1.173cm}{1.187cm}}{\pgfqpoint{1.147cm}{1.162cm}}
\pgfpathcurveto{\pgfqpoint{1.121cm}{1.136cm}}{\pgfqpoint{1.107cm}{1.101cm}}{\pgfqpoint{1.107cm}{1.065cm}}
\pgfpathcurveto{\pgfqpoint{1.107cm}{1.029cm}}{\pgfqpoint{1.121cm}{0.994cm}}{\pgfqpoint{1.147cm}{0.968cm}}
\pgfpathcurveto{\pgfqpoint{1.173cm}{0.942cm}}{\pgfqpoint{1.207cm}{0.928cm}}{\pgfqpoint{1.244cm}{0.928cm}}
\pgfpathcurveto{\pgfqpoint{1.28cm}{0.928cm}}{\pgfqpoint{1.315cm}{0.942cm}}{\pgfqpoint{1.34cm}{0.968cm}}
\pgfpathcurveto{\pgfqpoint{1.366cm}{0.994cm}}{\pgfqpoint{1.38cm}{1.029cm}}{\pgfqpoint{1.38cm}{1.065cm}}
\pgfusepath{fill}
\begin{pgfscope}
\pgfsetdash{}{0cm}
\pgfsetlinewidth{0.818mm}
\pgfsetmiterlimit{4.0}
\pgfpathmoveto{\pgfqpoint{1.383cm}{0.178cm}}
\pgfpathcurveto{\pgfqpoint{1.383cm}{0.214cm}}{\pgfqpoint{1.369cm}{0.249cm}}{\pgfqpoint{1.343cm}{0.275cm}}
\pgfpathcurveto{\pgfqpoint{1.317cm}{0.3cm}}{\pgfqpoint{1.283cm}{0.315cm}}{\pgfqpoint{1.246cm}{0.315cm}}
\pgfpathcurveto{\pgfqpoint{1.21cm}{0.315cm}}{\pgfqpoint{1.175cm}{0.3cm}}{\pgfqpoint{1.15cm}{0.275cm}}
\pgfpathcurveto{\pgfqpoint{1.124cm}{0.249cm}}{\pgfqpoint{1.11cm}{0.214cm}}{\pgfqpoint{1.11cm}{0.178cm}}
\pgfpathcurveto{\pgfqpoint{1.11cm}{0.141cm}}{\pgfqpoint{1.124cm}{0.107cm}}{\pgfqpoint{1.15cm}{0.081cm}}
\pgfpathcurveto{\pgfqpoint{1.175cm}{0.055cm}}{\pgfqpoint{1.21cm}{0.041cm}}{\pgfqpoint{1.246cm}{0.041cm}}
\pgfpathcurveto{\pgfqpoint{1.283cm}{0.041cm}}{\pgfqpoint{1.317cm}{0.055cm}}{\pgfqpoint{1.343cm}{0.081cm}}
\pgfpathcurveto{\pgfqpoint{1.369cm}{0.107cm}}{\pgfqpoint{1.383cm}{0.141cm}}{\pgfqpoint{1.383cm}{0.178cm}}
\pgfusepath{stroke}
\end{pgfscope}
\end{pgfscope}
\end{pgfscope}
\end{pgfscope}
\end{tikzpicture}}}\|_{\CC^{-\frac{3}{2}+\alpha}}\lesssim 2^{-(\frac{3}{2}-\alpha-\kappa)K/2}\|\phi+\psi\|_{L^\infty(\rho)}\\
&\lesssim  1+\|\psi\|_{L^\infty(\rho)}^{\varepsilon},
\end{align*}
\begin{align*}
&\|6\UU_>X\succ(X^{\!\resizebox{0.6em}{!}{
\begin{tikzpicture}
\pgfpathmoveto{\pgfqpoint{0cm}{-0.035cm}}
\pgfpathlineto{\pgfqpoint{1.376cm}{-0.035cm}}
\pgfpathlineto{\pgfqpoint{1.376cm}{1.552cm}}
\pgfpathlineto{\pgfqpoint{0cm}{1.552cm}}
\pgfpathclose
\pgfusepath{clip}
\begin{pgfscope}
\begin{pgfscope}
\pgfpathmoveto{\pgfqpoint{0cm}{-0.035cm}}
\pgfpathlineto{\pgfqpoint{1.376cm}{-0.035cm}}
\pgfpathlineto{\pgfqpoint{1.376cm}{1.552cm}}
\pgfpathlineto{\pgfqpoint{0cm}{1.552cm}}
\pgfpathclose
\pgfusepath{clip}
\begin{pgfscope}
\begin{pgfscope}
\pgfsetdash{}{0cm}
\pgfsetlinewidth{0.818mm}
\pgfsetroundcap
\pgfsetroundjoin
\pgfsetmiterlimit{7.0}
\definecolor{eps2pgf_color}{gray}{0}\pgfsetstrokecolor{eps2pgf_color}\pgfsetfillcolor{eps2pgf_color}
\pgfpathmoveto{\pgfqpoint{0.117cm}{1.421cm}}
\pgfpathlineto{\pgfqpoint{0.682cm}{0.671cm}}
\pgfpathlineto{\pgfqpoint{1.246cm}{1.421cm}}
\pgfusepath{stroke}
\end{pgfscope}
\definecolor{eps2pgf_color}{gray}{0}\pgfsetstrokecolor{eps2pgf_color}\pgfsetfillcolor{eps2pgf_color}
\pgfpathmoveto{\pgfqpoint{0.273cm}{1.395cm}}
\pgfpathcurveto{\pgfqpoint{0.273cm}{1.432cm}}{\pgfqpoint{0.259cm}{1.467cm}}{\pgfqpoint{0.233cm}{1.492cm}}
\pgfpathcurveto{\pgfqpoint{0.207cm}{1.518cm}}{\pgfqpoint{0.173cm}{1.532cm}}{\pgfqpoint{0.137cm}{1.532cm}}
\pgfpathcurveto{\pgfqpoint{0.1cm}{1.532cm}}{\pgfqpoint{0.066cm}{1.518cm}}{\pgfqpoint{0.04cm}{1.492cm}}
\pgfpathcurveto{\pgfqpoint{0.014cm}{1.467cm}}{\pgfqpoint{0cm}{1.432cm}}{\pgfqpoint{0cm}{1.395cm}}
\pgfpathcurveto{\pgfqpoint{0cm}{1.359cm}}{\pgfqpoint{0.014cm}{1.324cm}}{\pgfqpoint{0.04cm}{1.299cm}}
\pgfpathcurveto{\pgfqpoint{0.066cm}{1.273cm}}{\pgfqpoint{0.1cm}{1.258cm}}{\pgfqpoint{0.137cm}{1.258cm}}
\pgfpathcurveto{\pgfqpoint{0.173cm}{1.258cm}}{\pgfqpoint{0.207cm}{1.273cm}}{\pgfqpoint{0.233cm}{1.299cm}}
\pgfpathcurveto{\pgfqpoint{0.259cm}{1.324cm}}{\pgfqpoint{0.273cm}{1.359cm}}{\pgfqpoint{0.273cm}{1.395cm}}
\pgfusepath{fill}
\begin{pgfscope}
\pgfsetdash{}{0cm}
\pgfsetlinewidth{0.818mm}
\pgfsetmiterlimit{7.0}
\pgfpathmoveto{\pgfqpoint{0.682cm}{0.671cm}}
\pgfpathlineto{\pgfqpoint{0.679cm}{1.418cm}}
\pgfusepath{stroke}
\end{pgfscope}
\pgfpathmoveto{\pgfqpoint{0.815cm}{1.399cm}}
\pgfpathcurveto{\pgfqpoint{0.815cm}{1.435cm}}{\pgfqpoint{0.801cm}{1.47cm}}{\pgfqpoint{0.775cm}{1.496cm}}
\pgfpathcurveto{\pgfqpoint{0.75cm}{1.521cm}}{\pgfqpoint{0.715cm}{1.536cm}}{\pgfqpoint{0.679cm}{1.536cm}}
\pgfpathcurveto{\pgfqpoint{0.643cm}{1.536cm}}{\pgfqpoint{0.608cm}{1.521cm}}{\pgfqpoint{0.582cm}{1.496cm}}
\pgfpathcurveto{\pgfqpoint{0.557cm}{1.47cm}}{\pgfqpoint{0.542cm}{1.435cm}}{\pgfqpoint{0.542cm}{1.399cm}}
\pgfpathcurveto{\pgfqpoint{0.542cm}{1.363cm}}{\pgfqpoint{0.557cm}{1.328cm}}{\pgfqpoint{0.582cm}{1.302cm}}
\pgfpathcurveto{\pgfqpoint{0.608cm}{1.276cm}}{\pgfqpoint{0.643cm}{1.262cm}}{\pgfqpoint{0.679cm}{1.262cm}}
\pgfpathcurveto{\pgfqpoint{0.715cm}{1.262cm}}{\pgfqpoint{0.75cm}{1.276cm}}{\pgfqpoint{0.775cm}{1.302cm}}
\pgfpathcurveto{\pgfqpoint{0.801cm}{1.328cm}}{\pgfqpoint{0.815cm}{1.363cm}}{\pgfqpoint{0.815cm}{1.399cm}}
\pgfusepath{fill}
\pgfpathmoveto{\pgfqpoint{1.345cm}{1.371cm}}
\pgfpathcurveto{\pgfqpoint{1.345cm}{1.408cm}}{\pgfqpoint{1.331cm}{1.442cm}}{\pgfqpoint{1.305cm}{1.468cm}}
\pgfpathcurveto{\pgfqpoint{1.28cm}{1.494cm}}{\pgfqpoint{1.245cm}{1.508cm}}{\pgfqpoint{1.209cm}{1.508cm}}
\pgfpathcurveto{\pgfqpoint{1.172cm}{1.508cm}}{\pgfqpoint{1.138cm}{1.494cm}}{\pgfqpoint{1.112cm}{1.468cm}}
\pgfpathcurveto{\pgfqpoint{1.087cm}{1.442cm}}{\pgfqpoint{1.072cm}{1.408cm}}{\pgfqpoint{1.072cm}{1.371cm}}
\pgfpathcurveto{\pgfqpoint{1.072cm}{1.335cm}}{\pgfqpoint{1.087cm}{1.3cm}}{\pgfqpoint{1.112cm}{1.274cm}}
\pgfpathcurveto{\pgfqpoint{1.138cm}{1.249cm}}{\pgfqpoint{1.172cm}{1.234cm}}{\pgfqpoint{1.209cm}{1.234cm}}
\pgfpathcurveto{\pgfqpoint{1.245cm}{1.234cm}}{\pgfqpoint{1.28cm}{1.249cm}}{\pgfqpoint{1.305cm}{1.274cm}}
\pgfpathcurveto{\pgfqpoint{1.331cm}{1.3cm}}{\pgfqpoint{1.345cm}{1.335cm}}{\pgfqpoint{1.345cm}{1.371cm}}
\pgfusepath{fill}
\begin{pgfscope}
\pgfsetdash{}{0cm}
\pgfsetlinewidth{0.818mm}
\pgfsetroundcap
\pgfsetmiterlimit{4.0}
\pgfpathmoveto{\pgfqpoint{0.682cm}{0.671cm}}
\pgfpathlineto{\pgfqpoint{0.682cm}{0.042cm}}
\pgfusepath{stroke}
\end{pgfscope}
\end{pgfscope}
\end{pgfscope}
\end{pgfscope}
\end{tikzpicture}}}(\phi+\psi))\|_{\CC^{-\frac{3}{2}+\alpha}(\rho)}+\|6\UU_> X\prec(X^{\!\resizebox{0.6em}{!}{
\begin{tikzpicture}
\pgfpathmoveto{\pgfqpoint{0cm}{-0.035cm}}
\pgfpathlineto{\pgfqpoint{1.376cm}{-0.035cm}}
\pgfpathlineto{\pgfqpoint{1.376cm}{1.552cm}}
\pgfpathlineto{\pgfqpoint{0cm}{1.552cm}}
\pgfpathclose
\pgfusepath{clip}
\begin{pgfscope}
\begin{pgfscope}
\pgfpathmoveto{\pgfqpoint{0cm}{-0.035cm}}
\pgfpathlineto{\pgfqpoint{1.376cm}{-0.035cm}}
\pgfpathlineto{\pgfqpoint{1.376cm}{1.552cm}}
\pgfpathlineto{\pgfqpoint{0cm}{1.552cm}}
\pgfpathclose
\pgfusepath{clip}
\begin{pgfscope}
\begin{pgfscope}
\pgfsetdash{}{0cm}
\pgfsetlinewidth{0.818mm}
\pgfsetroundcap
\pgfsetroundjoin
\pgfsetmiterlimit{7.0}
\definecolor{eps2pgf_color}{gray}{0}\pgfsetstrokecolor{eps2pgf_color}\pgfsetfillcolor{eps2pgf_color}
\pgfpathmoveto{\pgfqpoint{0.117cm}{1.421cm}}
\pgfpathlineto{\pgfqpoint{0.682cm}{0.671cm}}
\pgfpathlineto{\pgfqpoint{1.246cm}{1.421cm}}
\pgfusepath{stroke}
\end{pgfscope}
\definecolor{eps2pgf_color}{gray}{0}\pgfsetstrokecolor{eps2pgf_color}\pgfsetfillcolor{eps2pgf_color}
\pgfpathmoveto{\pgfqpoint{0.273cm}{1.395cm}}
\pgfpathcurveto{\pgfqpoint{0.273cm}{1.432cm}}{\pgfqpoint{0.259cm}{1.467cm}}{\pgfqpoint{0.233cm}{1.492cm}}
\pgfpathcurveto{\pgfqpoint{0.207cm}{1.518cm}}{\pgfqpoint{0.173cm}{1.532cm}}{\pgfqpoint{0.137cm}{1.532cm}}
\pgfpathcurveto{\pgfqpoint{0.1cm}{1.532cm}}{\pgfqpoint{0.066cm}{1.518cm}}{\pgfqpoint{0.04cm}{1.492cm}}
\pgfpathcurveto{\pgfqpoint{0.014cm}{1.467cm}}{\pgfqpoint{0cm}{1.432cm}}{\pgfqpoint{0cm}{1.395cm}}
\pgfpathcurveto{\pgfqpoint{0cm}{1.359cm}}{\pgfqpoint{0.014cm}{1.324cm}}{\pgfqpoint{0.04cm}{1.299cm}}
\pgfpathcurveto{\pgfqpoint{0.066cm}{1.273cm}}{\pgfqpoint{0.1cm}{1.258cm}}{\pgfqpoint{0.137cm}{1.258cm}}
\pgfpathcurveto{\pgfqpoint{0.173cm}{1.258cm}}{\pgfqpoint{0.207cm}{1.273cm}}{\pgfqpoint{0.233cm}{1.299cm}}
\pgfpathcurveto{\pgfqpoint{0.259cm}{1.324cm}}{\pgfqpoint{0.273cm}{1.359cm}}{\pgfqpoint{0.273cm}{1.395cm}}
\pgfusepath{fill}
\begin{pgfscope}
\pgfsetdash{}{0cm}
\pgfsetlinewidth{0.818mm}
\pgfsetmiterlimit{7.0}
\pgfpathmoveto{\pgfqpoint{0.682cm}{0.671cm}}
\pgfpathlineto{\pgfqpoint{0.679cm}{1.418cm}}
\pgfusepath{stroke}
\end{pgfscope}
\pgfpathmoveto{\pgfqpoint{0.815cm}{1.399cm}}
\pgfpathcurveto{\pgfqpoint{0.815cm}{1.435cm}}{\pgfqpoint{0.801cm}{1.47cm}}{\pgfqpoint{0.775cm}{1.496cm}}
\pgfpathcurveto{\pgfqpoint{0.75cm}{1.521cm}}{\pgfqpoint{0.715cm}{1.536cm}}{\pgfqpoint{0.679cm}{1.536cm}}
\pgfpathcurveto{\pgfqpoint{0.643cm}{1.536cm}}{\pgfqpoint{0.608cm}{1.521cm}}{\pgfqpoint{0.582cm}{1.496cm}}
\pgfpathcurveto{\pgfqpoint{0.557cm}{1.47cm}}{\pgfqpoint{0.542cm}{1.435cm}}{\pgfqpoint{0.542cm}{1.399cm}}
\pgfpathcurveto{\pgfqpoint{0.542cm}{1.363cm}}{\pgfqpoint{0.557cm}{1.328cm}}{\pgfqpoint{0.582cm}{1.302cm}}
\pgfpathcurveto{\pgfqpoint{0.608cm}{1.276cm}}{\pgfqpoint{0.643cm}{1.262cm}}{\pgfqpoint{0.679cm}{1.262cm}}
\pgfpathcurveto{\pgfqpoint{0.715cm}{1.262cm}}{\pgfqpoint{0.75cm}{1.276cm}}{\pgfqpoint{0.775cm}{1.302cm}}
\pgfpathcurveto{\pgfqpoint{0.801cm}{1.328cm}}{\pgfqpoint{0.815cm}{1.363cm}}{\pgfqpoint{0.815cm}{1.399cm}}
\pgfusepath{fill}
\pgfpathmoveto{\pgfqpoint{1.345cm}{1.371cm}}
\pgfpathcurveto{\pgfqpoint{1.345cm}{1.408cm}}{\pgfqpoint{1.331cm}{1.442cm}}{\pgfqpoint{1.305cm}{1.468cm}}
\pgfpathcurveto{\pgfqpoint{1.28cm}{1.494cm}}{\pgfqpoint{1.245cm}{1.508cm}}{\pgfqpoint{1.209cm}{1.508cm}}
\pgfpathcurveto{\pgfqpoint{1.172cm}{1.508cm}}{\pgfqpoint{1.138cm}{1.494cm}}{\pgfqpoint{1.112cm}{1.468cm}}
\pgfpathcurveto{\pgfqpoint{1.087cm}{1.442cm}}{\pgfqpoint{1.072cm}{1.408cm}}{\pgfqpoint{1.072cm}{1.371cm}}
\pgfpathcurveto{\pgfqpoint{1.072cm}{1.335cm}}{\pgfqpoint{1.087cm}{1.3cm}}{\pgfqpoint{1.112cm}{1.274cm}}
\pgfpathcurveto{\pgfqpoint{1.138cm}{1.249cm}}{\pgfqpoint{1.172cm}{1.234cm}}{\pgfqpoint{1.209cm}{1.234cm}}
\pgfpathcurveto{\pgfqpoint{1.245cm}{1.234cm}}{\pgfqpoint{1.28cm}{1.249cm}}{\pgfqpoint{1.305cm}{1.274cm}}
\pgfpathcurveto{\pgfqpoint{1.331cm}{1.3cm}}{\pgfqpoint{1.345cm}{1.335cm}}{\pgfqpoint{1.345cm}{1.371cm}}
\pgfusepath{fill}
\begin{pgfscope}
\pgfsetdash{}{0cm}
\pgfsetlinewidth{0.818mm}
\pgfsetroundcap
\pgfsetmiterlimit{4.0}
\pgfpathmoveto{\pgfqpoint{0.682cm}{0.671cm}}
\pgfpathlineto{\pgfqpoint{0.682cm}{0.042cm}}
\pgfusepath{stroke}
\end{pgfscope}
\end{pgfscope}
\end{pgfscope}
\end{pgfscope}
\end{tikzpicture}}}(\phi+\psi))\|_{\CC^{-\frac{3}{2}+\alpha}(\rho)}\\
&\quad\lesssim  \|\phi+\psi\|_{L^\infty(\rho)}\|\UU_>X\|_{\CC^{-\frac{3}{2}+\alpha}(\rho^{-\alpha})}\\
&\quad\lesssim 2^{-(1-\alpha-\kappa)\frac{2}{3}K}\|\phi+\psi\|_{L^\infty(\rho)}\lesssim 1+\|\psi\|_{L^\infty(\rho)}^{\varepsilon},
\end{align*}
\begin{align*}
\|6(\phi+\psi)\prec\UU_{>}X^{\!\resizebox{!}{.8em}{
\begin{tikzpicture}
\pgfpathmoveto{\pgfqpoint{0cm}{-0.035cm}}
\pgfpathlineto{\pgfqpoint{1.976cm}{-0.035cm}}
\pgfpathlineto{\pgfqpoint{1.976cm}{1.94cm}}
\pgfpathlineto{\pgfqpoint{0cm}{1.94cm}}
\pgfpathclose
\pgfusepath{clip}
\begin{pgfscope}
\begin{pgfscope}
\pgfpathmoveto{\pgfqpoint{0cm}{-0.035cm}}
\pgfpathlineto{\pgfqpoint{1.976cm}{-0.035cm}}
\pgfpathlineto{\pgfqpoint{1.976cm}{1.94cm}}
\pgfpathlineto{\pgfqpoint{0cm}{1.94cm}}
\pgfpathclose
\pgfusepath{clip}
\begin{pgfscope}
\begin{pgfscope}
\pgfsetdash{}{0cm}
\pgfsetlinewidth{0.818mm}
\pgfsetroundcap
\pgfsetroundjoin
\pgfsetmiterlimit{7.0}
\definecolor{eps2pgf_color}{gray}{0}\pgfsetstrokecolor{eps2pgf_color}\pgfsetfillcolor{eps2pgf_color}
\pgfpathmoveto{\pgfqpoint{0.117cm}{1.815cm}}
\pgfpathlineto{\pgfqpoint{0.682cm}{1.065cm}}
\pgfpathlineto{\pgfqpoint{1.246cm}{1.815cm}}
\pgfusepath{stroke}
\end{pgfscope}
\definecolor{eps2pgf_color}{gray}{0}\pgfsetstrokecolor{eps2pgf_color}\pgfsetfillcolor{eps2pgf_color}
\pgfpathmoveto{\pgfqpoint{0.273cm}{1.789cm}}
\pgfpathcurveto{\pgfqpoint{0.273cm}{1.825cm}}{\pgfqpoint{0.259cm}{1.86cm}}{\pgfqpoint{0.233cm}{1.886cm}}
\pgfpathcurveto{\pgfqpoint{0.207cm}{1.912cm}}{\pgfqpoint{0.173cm}{1.926cm}}{\pgfqpoint{0.137cm}{1.926cm}}
\pgfpathcurveto{\pgfqpoint{0.1cm}{1.926cm}}{\pgfqpoint{0.066cm}{1.912cm}}{\pgfqpoint{0.04cm}{1.886cm}}
\pgfpathcurveto{\pgfqpoint{0.014cm}{1.86cm}}{\pgfqpoint{0cm}{1.825cm}}{\pgfqpoint{0cm}{1.789cm}}
\pgfpathcurveto{\pgfqpoint{0cm}{1.753cm}}{\pgfqpoint{0.014cm}{1.718cm}}{\pgfqpoint{0.04cm}{1.692cm}}
\pgfpathcurveto{\pgfqpoint{0.066cm}{1.667cm}}{\pgfqpoint{0.1cm}{1.652cm}}{\pgfqpoint{0.137cm}{1.652cm}}
\pgfpathcurveto{\pgfqpoint{0.173cm}{1.652cm}}{\pgfqpoint{0.207cm}{1.667cm}}{\pgfqpoint{0.233cm}{1.692cm}}
\pgfpathcurveto{\pgfqpoint{0.259cm}{1.718cm}}{\pgfqpoint{0.273cm}{1.753cm}}{\pgfqpoint{0.273cm}{1.789cm}}
\pgfusepath{fill}
\begin{pgfscope}
\pgfsetdash{}{0cm}
\pgfsetlinewidth{0.818mm}
\pgfsetmiterlimit{7.0}
\pgfpathmoveto{\pgfqpoint{0.682cm}{1.065cm}}
\pgfpathlineto{\pgfqpoint{0.679cm}{1.812cm}}
\pgfusepath{stroke}
\end{pgfscope}
\pgfpathmoveto{\pgfqpoint{0.815cm}{1.793cm}}
\pgfpathcurveto{\pgfqpoint{0.815cm}{1.829cm}}{\pgfqpoint{0.801cm}{1.864cm}}{\pgfqpoint{0.775cm}{1.89cm}}
\pgfpathcurveto{\pgfqpoint{0.75cm}{1.915cm}}{\pgfqpoint{0.715cm}{1.93cm}}{\pgfqpoint{0.679cm}{1.93cm}}
\pgfpathcurveto{\pgfqpoint{0.643cm}{1.93cm}}{\pgfqpoint{0.608cm}{1.915cm}}{\pgfqpoint{0.582cm}{1.89cm}}
\pgfpathcurveto{\pgfqpoint{0.557cm}{1.864cm}}{\pgfqpoint{0.542cm}{1.829cm}}{\pgfqpoint{0.542cm}{1.793cm}}
\pgfpathcurveto{\pgfqpoint{0.542cm}{1.756cm}}{\pgfqpoint{0.557cm}{1.722cm}}{\pgfqpoint{0.582cm}{1.696cm}}
\pgfpathcurveto{\pgfqpoint{0.608cm}{1.67cm}}{\pgfqpoint{0.643cm}{1.656cm}}{\pgfqpoint{0.679cm}{1.656cm}}
\pgfpathcurveto{\pgfqpoint{0.715cm}{1.656cm}}{\pgfqpoint{0.75cm}{1.67cm}}{\pgfqpoint{0.775cm}{1.696cm}}
\pgfpathcurveto{\pgfqpoint{0.801cm}{1.722cm}}{\pgfqpoint{0.815cm}{1.756cm}}{\pgfqpoint{0.815cm}{1.793cm}}
\pgfusepath{fill}
\pgfpathmoveto{\pgfqpoint{1.345cm}{1.765cm}}
\pgfpathcurveto{\pgfqpoint{1.345cm}{1.801cm}}{\pgfqpoint{1.331cm}{1.836cm}}{\pgfqpoint{1.305cm}{1.862cm}}
\pgfpathcurveto{\pgfqpoint{1.28cm}{1.887cm}}{\pgfqpoint{1.245cm}{1.902cm}}{\pgfqpoint{1.209cm}{1.902cm}}
\pgfpathcurveto{\pgfqpoint{1.172cm}{1.902cm}}{\pgfqpoint{1.138cm}{1.887cm}}{\pgfqpoint{1.112cm}{1.862cm}}
\pgfpathcurveto{\pgfqpoint{1.087cm}{1.836cm}}{\pgfqpoint{1.072cm}{1.801cm}}{\pgfqpoint{1.072cm}{1.765cm}}
\pgfpathcurveto{\pgfqpoint{1.072cm}{1.728cm}}{\pgfqpoint{1.087cm}{1.694cm}}{\pgfqpoint{1.112cm}{1.668cm}}
\pgfpathcurveto{\pgfqpoint{1.138cm}{1.642cm}}{\pgfqpoint{1.172cm}{1.628cm}}{\pgfqpoint{1.209cm}{1.628cm}}
\pgfpathcurveto{\pgfqpoint{1.245cm}{1.628cm}}{\pgfqpoint{1.28cm}{1.642cm}}{\pgfqpoint{1.305cm}{1.668cm}}
\pgfpathcurveto{\pgfqpoint{1.331cm}{1.694cm}}{\pgfqpoint{1.345cm}{1.728cm}}{\pgfqpoint{1.345cm}{1.765cm}}
\pgfusepath{fill}
\begin{pgfscope}
\pgfsetdash{}{0cm}
\pgfsetlinewidth{0.818mm}
\pgfsetroundcap
\pgfsetroundjoin
\pgfsetmiterlimit{7.0}
\pgfpathmoveto{\pgfqpoint{0.682cm}{1.065cm}}
\pgfpathlineto{\pgfqpoint{1.246cm}{0.315cm}}
\pgfpathlineto{\pgfqpoint{1.811cm}{1.065cm}}
\pgfusepath{stroke}
\end{pgfscope}
\pgfpathmoveto{\pgfqpoint{1.948cm}{1.065cm}}
\pgfpathcurveto{\pgfqpoint{1.948cm}{1.101cm}}{\pgfqpoint{1.933cm}{1.136cm}}{\pgfqpoint{1.907cm}{1.162cm}}
\pgfpathcurveto{\pgfqpoint{1.882cm}{1.187cm}}{\pgfqpoint{1.847cm}{1.202cm}}{\pgfqpoint{1.811cm}{1.202cm}}
\pgfpathcurveto{\pgfqpoint{1.775cm}{1.202cm}}{\pgfqpoint{1.74cm}{1.187cm}}{\pgfqpoint{1.714cm}{1.162cm}}
\pgfpathcurveto{\pgfqpoint{1.689cm}{1.136cm}}{\pgfqpoint{1.674cm}{1.101cm}}{\pgfqpoint{1.674cm}{1.065cm}}
\pgfpathcurveto{\pgfqpoint{1.674cm}{1.029cm}}{\pgfqpoint{1.689cm}{0.994cm}}{\pgfqpoint{1.714cm}{0.968cm}}
\pgfpathcurveto{\pgfqpoint{1.74cm}{0.942cm}}{\pgfqpoint{1.775cm}{0.928cm}}{\pgfqpoint{1.811cm}{0.928cm}}
\pgfpathcurveto{\pgfqpoint{1.847cm}{0.928cm}}{\pgfqpoint{1.882cm}{0.942cm}}{\pgfqpoint{1.907cm}{0.968cm}}
\pgfpathcurveto{\pgfqpoint{1.933cm}{0.994cm}}{\pgfqpoint{1.948cm}{1.029cm}}{\pgfqpoint{1.948cm}{1.065cm}}
\pgfusepath{fill}
\begin{pgfscope}
\pgfsetdash{}{0cm}
\pgfsetlinewidth{0.818mm}
\pgfsetmiterlimit{4.0}
\pgfpathmoveto{\pgfqpoint{1.383cm}{0.178cm}}
\pgfpathcurveto{\pgfqpoint{1.383cm}{0.214cm}}{\pgfqpoint{1.369cm}{0.249cm}}{\pgfqpoint{1.343cm}{0.275cm}}
\pgfpathcurveto{\pgfqpoint{1.317cm}{0.3cm}}{\pgfqpoint{1.283cm}{0.315cm}}{\pgfqpoint{1.246cm}{0.315cm}}
\pgfpathcurveto{\pgfqpoint{1.21cm}{0.315cm}}{\pgfqpoint{1.175cm}{0.3cm}}{\pgfqpoint{1.15cm}{0.275cm}}
\pgfpathcurveto{\pgfqpoint{1.124cm}{0.249cm}}{\pgfqpoint{1.11cm}{0.214cm}}{\pgfqpoint{1.11cm}{0.178cm}}
\pgfpathcurveto{\pgfqpoint{1.11cm}{0.141cm}}{\pgfqpoint{1.124cm}{0.107cm}}{\pgfqpoint{1.15cm}{0.081cm}}
\pgfpathcurveto{\pgfqpoint{1.175cm}{0.055cm}}{\pgfqpoint{1.21cm}{0.041cm}}{\pgfqpoint{1.246cm}{0.041cm}}
\pgfpathcurveto{\pgfqpoint{1.283cm}{0.041cm}}{\pgfqpoint{1.317cm}{0.055cm}}{\pgfqpoint{1.343cm}{0.081cm}}
\pgfpathcurveto{\pgfqpoint{1.369cm}{0.107cm}}{\pgfqpoint{1.383cm}{0.141cm}}{\pgfqpoint{1.383cm}{0.178cm}}
\pgfusepath{stroke}
\end{pgfscope}
\end{pgfscope}
\end{pgfscope}
\end{pgfscope}
\end{tikzpicture}}}\|_{\CC^{-\frac{3}{2}+\alpha}(\rho)}
&\lesssim \|\phi+\psi\|_{L^\infty(\rho)}\|\UU_>X^{\!\resizebox{!}{.8em}{
\begin{tikzpicture}
\pgfpathmoveto{\pgfqpoint{0cm}{-0.035cm}}
\pgfpathlineto{\pgfqpoint{1.976cm}{-0.035cm}}
\pgfpathlineto{\pgfqpoint{1.976cm}{1.94cm}}
\pgfpathlineto{\pgfqpoint{0cm}{1.94cm}}
\pgfpathclose
\pgfusepath{clip}
\begin{pgfscope}
\begin{pgfscope}
\pgfpathmoveto{\pgfqpoint{0cm}{-0.035cm}}
\pgfpathlineto{\pgfqpoint{1.976cm}{-0.035cm}}
\pgfpathlineto{\pgfqpoint{1.976cm}{1.94cm}}
\pgfpathlineto{\pgfqpoint{0cm}{1.94cm}}
\pgfpathclose
\pgfusepath{clip}
\begin{pgfscope}
\begin{pgfscope}
\pgfsetdash{}{0cm}
\pgfsetlinewidth{0.818mm}
\pgfsetroundcap
\pgfsetroundjoin
\pgfsetmiterlimit{7.0}
\definecolor{eps2pgf_color}{gray}{0}\pgfsetstrokecolor{eps2pgf_color}\pgfsetfillcolor{eps2pgf_color}
\pgfpathmoveto{\pgfqpoint{0.117cm}{1.815cm}}
\pgfpathlineto{\pgfqpoint{0.682cm}{1.065cm}}
\pgfpathlineto{\pgfqpoint{1.246cm}{1.815cm}}
\pgfusepath{stroke}
\end{pgfscope}
\definecolor{eps2pgf_color}{gray}{0}\pgfsetstrokecolor{eps2pgf_color}\pgfsetfillcolor{eps2pgf_color}
\pgfpathmoveto{\pgfqpoint{0.273cm}{1.789cm}}
\pgfpathcurveto{\pgfqpoint{0.273cm}{1.825cm}}{\pgfqpoint{0.259cm}{1.86cm}}{\pgfqpoint{0.233cm}{1.886cm}}
\pgfpathcurveto{\pgfqpoint{0.207cm}{1.912cm}}{\pgfqpoint{0.173cm}{1.926cm}}{\pgfqpoint{0.137cm}{1.926cm}}
\pgfpathcurveto{\pgfqpoint{0.1cm}{1.926cm}}{\pgfqpoint{0.066cm}{1.912cm}}{\pgfqpoint{0.04cm}{1.886cm}}
\pgfpathcurveto{\pgfqpoint{0.014cm}{1.86cm}}{\pgfqpoint{0cm}{1.825cm}}{\pgfqpoint{0cm}{1.789cm}}
\pgfpathcurveto{\pgfqpoint{0cm}{1.753cm}}{\pgfqpoint{0.014cm}{1.718cm}}{\pgfqpoint{0.04cm}{1.692cm}}
\pgfpathcurveto{\pgfqpoint{0.066cm}{1.667cm}}{\pgfqpoint{0.1cm}{1.652cm}}{\pgfqpoint{0.137cm}{1.652cm}}
\pgfpathcurveto{\pgfqpoint{0.173cm}{1.652cm}}{\pgfqpoint{0.207cm}{1.667cm}}{\pgfqpoint{0.233cm}{1.692cm}}
\pgfpathcurveto{\pgfqpoint{0.259cm}{1.718cm}}{\pgfqpoint{0.273cm}{1.753cm}}{\pgfqpoint{0.273cm}{1.789cm}}
\pgfusepath{fill}
\begin{pgfscope}
\pgfsetdash{}{0cm}
\pgfsetlinewidth{0.818mm}
\pgfsetmiterlimit{7.0}
\pgfpathmoveto{\pgfqpoint{0.682cm}{1.065cm}}
\pgfpathlineto{\pgfqpoint{0.679cm}{1.812cm}}
\pgfusepath{stroke}
\end{pgfscope}
\pgfpathmoveto{\pgfqpoint{0.815cm}{1.793cm}}
\pgfpathcurveto{\pgfqpoint{0.815cm}{1.829cm}}{\pgfqpoint{0.801cm}{1.864cm}}{\pgfqpoint{0.775cm}{1.89cm}}
\pgfpathcurveto{\pgfqpoint{0.75cm}{1.915cm}}{\pgfqpoint{0.715cm}{1.93cm}}{\pgfqpoint{0.679cm}{1.93cm}}
\pgfpathcurveto{\pgfqpoint{0.643cm}{1.93cm}}{\pgfqpoint{0.608cm}{1.915cm}}{\pgfqpoint{0.582cm}{1.89cm}}
\pgfpathcurveto{\pgfqpoint{0.557cm}{1.864cm}}{\pgfqpoint{0.542cm}{1.829cm}}{\pgfqpoint{0.542cm}{1.793cm}}
\pgfpathcurveto{\pgfqpoint{0.542cm}{1.756cm}}{\pgfqpoint{0.557cm}{1.722cm}}{\pgfqpoint{0.582cm}{1.696cm}}
\pgfpathcurveto{\pgfqpoint{0.608cm}{1.67cm}}{\pgfqpoint{0.643cm}{1.656cm}}{\pgfqpoint{0.679cm}{1.656cm}}
\pgfpathcurveto{\pgfqpoint{0.715cm}{1.656cm}}{\pgfqpoint{0.75cm}{1.67cm}}{\pgfqpoint{0.775cm}{1.696cm}}
\pgfpathcurveto{\pgfqpoint{0.801cm}{1.722cm}}{\pgfqpoint{0.815cm}{1.756cm}}{\pgfqpoint{0.815cm}{1.793cm}}
\pgfusepath{fill}
\pgfpathmoveto{\pgfqpoint{1.345cm}{1.765cm}}
\pgfpathcurveto{\pgfqpoint{1.345cm}{1.801cm}}{\pgfqpoint{1.331cm}{1.836cm}}{\pgfqpoint{1.305cm}{1.862cm}}
\pgfpathcurveto{\pgfqpoint{1.28cm}{1.887cm}}{\pgfqpoint{1.245cm}{1.902cm}}{\pgfqpoint{1.209cm}{1.902cm}}
\pgfpathcurveto{\pgfqpoint{1.172cm}{1.902cm}}{\pgfqpoint{1.138cm}{1.887cm}}{\pgfqpoint{1.112cm}{1.862cm}}
\pgfpathcurveto{\pgfqpoint{1.087cm}{1.836cm}}{\pgfqpoint{1.072cm}{1.801cm}}{\pgfqpoint{1.072cm}{1.765cm}}
\pgfpathcurveto{\pgfqpoint{1.072cm}{1.728cm}}{\pgfqpoint{1.087cm}{1.694cm}}{\pgfqpoint{1.112cm}{1.668cm}}
\pgfpathcurveto{\pgfqpoint{1.138cm}{1.642cm}}{\pgfqpoint{1.172cm}{1.628cm}}{\pgfqpoint{1.209cm}{1.628cm}}
\pgfpathcurveto{\pgfqpoint{1.245cm}{1.628cm}}{\pgfqpoint{1.28cm}{1.642cm}}{\pgfqpoint{1.305cm}{1.668cm}}
\pgfpathcurveto{\pgfqpoint{1.331cm}{1.694cm}}{\pgfqpoint{1.345cm}{1.728cm}}{\pgfqpoint{1.345cm}{1.765cm}}
\pgfusepath{fill}
\begin{pgfscope}
\pgfsetdash{}{0cm}
\pgfsetlinewidth{0.818mm}
\pgfsetroundcap
\pgfsetroundjoin
\pgfsetmiterlimit{7.0}
\pgfpathmoveto{\pgfqpoint{0.682cm}{1.065cm}}
\pgfpathlineto{\pgfqpoint{1.246cm}{0.315cm}}
\pgfpathlineto{\pgfqpoint{1.811cm}{1.065cm}}
\pgfusepath{stroke}
\end{pgfscope}
\pgfpathmoveto{\pgfqpoint{1.948cm}{1.065cm}}
\pgfpathcurveto{\pgfqpoint{1.948cm}{1.101cm}}{\pgfqpoint{1.933cm}{1.136cm}}{\pgfqpoint{1.907cm}{1.162cm}}
\pgfpathcurveto{\pgfqpoint{1.882cm}{1.187cm}}{\pgfqpoint{1.847cm}{1.202cm}}{\pgfqpoint{1.811cm}{1.202cm}}
\pgfpathcurveto{\pgfqpoint{1.775cm}{1.202cm}}{\pgfqpoint{1.74cm}{1.187cm}}{\pgfqpoint{1.714cm}{1.162cm}}
\pgfpathcurveto{\pgfqpoint{1.689cm}{1.136cm}}{\pgfqpoint{1.674cm}{1.101cm}}{\pgfqpoint{1.674cm}{1.065cm}}
\pgfpathcurveto{\pgfqpoint{1.674cm}{1.029cm}}{\pgfqpoint{1.689cm}{0.994cm}}{\pgfqpoint{1.714cm}{0.968cm}}
\pgfpathcurveto{\pgfqpoint{1.74cm}{0.942cm}}{\pgfqpoint{1.775cm}{0.928cm}}{\pgfqpoint{1.811cm}{0.928cm}}
\pgfpathcurveto{\pgfqpoint{1.847cm}{0.928cm}}{\pgfqpoint{1.882cm}{0.942cm}}{\pgfqpoint{1.907cm}{0.968cm}}
\pgfpathcurveto{\pgfqpoint{1.933cm}{0.994cm}}{\pgfqpoint{1.948cm}{1.029cm}}{\pgfqpoint{1.948cm}{1.065cm}}
\pgfusepath{fill}
\begin{pgfscope}
\pgfsetdash{}{0cm}
\pgfsetlinewidth{0.818mm}
\pgfsetmiterlimit{4.0}
\pgfpathmoveto{\pgfqpoint{1.383cm}{0.178cm}}
\pgfpathcurveto{\pgfqpoint{1.383cm}{0.214cm}}{\pgfqpoint{1.369cm}{0.249cm}}{\pgfqpoint{1.343cm}{0.275cm}}
\pgfpathcurveto{\pgfqpoint{1.317cm}{0.3cm}}{\pgfqpoint{1.283cm}{0.315cm}}{\pgfqpoint{1.246cm}{0.315cm}}
\pgfpathcurveto{\pgfqpoint{1.21cm}{0.315cm}}{\pgfqpoint{1.175cm}{0.3cm}}{\pgfqpoint{1.15cm}{0.275cm}}
\pgfpathcurveto{\pgfqpoint{1.124cm}{0.249cm}}{\pgfqpoint{1.11cm}{0.214cm}}{\pgfqpoint{1.11cm}{0.178cm}}
\pgfpathcurveto{\pgfqpoint{1.11cm}{0.141cm}}{\pgfqpoint{1.124cm}{0.107cm}}{\pgfqpoint{1.15cm}{0.081cm}}
\pgfpathcurveto{\pgfqpoint{1.175cm}{0.055cm}}{\pgfqpoint{1.21cm}{0.041cm}}{\pgfqpoint{1.246cm}{0.041cm}}
\pgfpathcurveto{\pgfqpoint{1.283cm}{0.041cm}}{\pgfqpoint{1.317cm}{0.055cm}}{\pgfqpoint{1.343cm}{0.081cm}}
\pgfpathcurveto{\pgfqpoint{1.369cm}{0.107cm}}{\pgfqpoint{1.383cm}{0.141cm}}{\pgfqpoint{1.383cm}{0.178cm}}
\pgfusepath{stroke}
\end{pgfscope}
\end{pgfscope}
\end{pgfscope}
\end{pgfscope}
\end{tikzpicture}}}\|_{\CC^{-\frac{3}{2}+\alpha}}\\
&\lesssim 2^{-(\frac{3}{2}-\alpha-\kappa)K/2}\|\phi+\psi\|_{L^\infty(\rho)}\lesssim 1+\|\psi\|_{L^\infty(\rho)}^{\varepsilon}.
\end{align*}
For the last term, we note that a higher power of $\rho$ is necessary and estimate
\begin{align*}
\|3\UU_> X\succ(\phi+\psi)^2\|_{\CC^{-\frac{3}{2}+\alpha}(\rho^{\frac{3}{2}+\alpha})}
&\lesssim \|\phi+\psi\|^2_{L^\infty(\rho)}\|\UU_>X\|_{\CC^{-\frac{3}{2}+\alpha}(\rho^{-\frac{1}{2}+\alpha})}\\
&\lesssim 2^{-(1-\alpha-\kappa)\frac{4}{3}K}\|\phi+\psi\|_{L^\infty(\rho)}^2\lesssim 1+\|\psi\|_{L^\infty(\rho)}^{\varepsilon},
\end{align*}

To summarize, collecting all the above estimates and using the Schauder estimates we  deduce that 
\begin{align}\label{eq:11}
& \|\phi\|_{\CC^{\frac{1}{2}+\alpha}(\rho^{\frac{3}{2}+\alpha})}\lesssim\|\Phi\|_{\CC^{-\frac{3}{2}+\alpha}(\rho^{\frac{3}{2}+\alpha})}\lesssim 1+\|\psi\|_{L^\infty(\rho)}^{\varepsilon}.
\end{align}

\subsection{Bound for $\vartheta$ in $\CC^{1+\alpha}(\rho^{2+\alpha})$}
\label{ssec:theta}

As the next step, we derive a bound for $\vartheta$ in $\CC^{1+\alpha}(\rho^{2+\alpha})$ which will be needed in the sequel in order to control $\psi$. In view of the paracontrolled ansatz \eqref{eq:th}, equation \eqref{eq:rhs45a} as well as the decomposition \eqref{eq:two}, we observe that the most irregular part of $\Phi$, namely the two magenta terms coming from \eqref{eq:2}, cancel out, and additionally the blue term coming from \eqref{eq:2} and a commutator appear. More precisely,  $\vartheta$ solves
$$
\Q\vartheta + \Theta =0,
$$
with
$$
\Theta=\Phi +\rmm{3\llbracket X^2 \rrbracket\succX^{\!\resizebox{0.6em}{!}{
\begin{tikzpicture}
\pgfpathmoveto{\pgfqpoint{0cm}{-0.035cm}}
\pgfpathlineto{\pgfqpoint{1.376cm}{-0.035cm}}
\pgfpathlineto{\pgfqpoint{1.376cm}{1.552cm}}
\pgfpathlineto{\pgfqpoint{0cm}{1.552cm}}
\pgfpathclose
\pgfusepath{clip}
\begin{pgfscope}
\begin{pgfscope}
\pgfpathmoveto{\pgfqpoint{0cm}{-0.035cm}}
\pgfpathlineto{\pgfqpoint{1.376cm}{-0.035cm}}
\pgfpathlineto{\pgfqpoint{1.376cm}{1.552cm}}
\pgfpathlineto{\pgfqpoint{0cm}{1.552cm}}
\pgfpathclose
\pgfusepath{clip}
\begin{pgfscope}
\begin{pgfscope}
\pgfsetdash{}{0cm}
\pgfsetlinewidth{0.818mm}
\pgfsetroundcap
\pgfsetroundjoin
\pgfsetmiterlimit{7.0}
\definecolor{eps2pgf_color}{gray}{0}\pgfsetstrokecolor{eps2pgf_color}\pgfsetfillcolor{eps2pgf_color}
\pgfpathmoveto{\pgfqpoint{0.117cm}{1.421cm}}
\pgfpathlineto{\pgfqpoint{0.682cm}{0.671cm}}
\pgfpathlineto{\pgfqpoint{1.246cm}{1.421cm}}
\pgfusepath{stroke}
\end{pgfscope}
\definecolor{eps2pgf_color}{gray}{0}\pgfsetstrokecolor{eps2pgf_color}\pgfsetfillcolor{eps2pgf_color}
\pgfpathmoveto{\pgfqpoint{0.273cm}{1.395cm}}
\pgfpathcurveto{\pgfqpoint{0.273cm}{1.432cm}}{\pgfqpoint{0.259cm}{1.467cm}}{\pgfqpoint{0.233cm}{1.492cm}}
\pgfpathcurveto{\pgfqpoint{0.207cm}{1.518cm}}{\pgfqpoint{0.173cm}{1.532cm}}{\pgfqpoint{0.137cm}{1.532cm}}
\pgfpathcurveto{\pgfqpoint{0.1cm}{1.532cm}}{\pgfqpoint{0.066cm}{1.518cm}}{\pgfqpoint{0.04cm}{1.492cm}}
\pgfpathcurveto{\pgfqpoint{0.014cm}{1.467cm}}{\pgfqpoint{0cm}{1.432cm}}{\pgfqpoint{0cm}{1.395cm}}
\pgfpathcurveto{\pgfqpoint{0cm}{1.359cm}}{\pgfqpoint{0.014cm}{1.324cm}}{\pgfqpoint{0.04cm}{1.299cm}}
\pgfpathcurveto{\pgfqpoint{0.066cm}{1.273cm}}{\pgfqpoint{0.1cm}{1.258cm}}{\pgfqpoint{0.137cm}{1.258cm}}
\pgfpathcurveto{\pgfqpoint{0.173cm}{1.258cm}}{\pgfqpoint{0.207cm}{1.273cm}}{\pgfqpoint{0.233cm}{1.299cm}}
\pgfpathcurveto{\pgfqpoint{0.259cm}{1.324cm}}{\pgfqpoint{0.273cm}{1.359cm}}{\pgfqpoint{0.273cm}{1.395cm}}
\pgfusepath{fill}
\begin{pgfscope}
\pgfsetdash{}{0cm}
\pgfsetlinewidth{0.818mm}
\pgfsetmiterlimit{7.0}
\pgfpathmoveto{\pgfqpoint{0.682cm}{0.671cm}}
\pgfpathlineto{\pgfqpoint{0.679cm}{1.418cm}}
\pgfusepath{stroke}
\end{pgfscope}
\pgfpathmoveto{\pgfqpoint{0.815cm}{1.399cm}}
\pgfpathcurveto{\pgfqpoint{0.815cm}{1.435cm}}{\pgfqpoint{0.801cm}{1.47cm}}{\pgfqpoint{0.775cm}{1.496cm}}
\pgfpathcurveto{\pgfqpoint{0.75cm}{1.521cm}}{\pgfqpoint{0.715cm}{1.536cm}}{\pgfqpoint{0.679cm}{1.536cm}}
\pgfpathcurveto{\pgfqpoint{0.643cm}{1.536cm}}{\pgfqpoint{0.608cm}{1.521cm}}{\pgfqpoint{0.582cm}{1.496cm}}
\pgfpathcurveto{\pgfqpoint{0.557cm}{1.47cm}}{\pgfqpoint{0.542cm}{1.435cm}}{\pgfqpoint{0.542cm}{1.399cm}}
\pgfpathcurveto{\pgfqpoint{0.542cm}{1.363cm}}{\pgfqpoint{0.557cm}{1.328cm}}{\pgfqpoint{0.582cm}{1.302cm}}
\pgfpathcurveto{\pgfqpoint{0.608cm}{1.276cm}}{\pgfqpoint{0.643cm}{1.262cm}}{\pgfqpoint{0.679cm}{1.262cm}}
\pgfpathcurveto{\pgfqpoint{0.715cm}{1.262cm}}{\pgfqpoint{0.75cm}{1.276cm}}{\pgfqpoint{0.775cm}{1.302cm}}
\pgfpathcurveto{\pgfqpoint{0.801cm}{1.328cm}}{\pgfqpoint{0.815cm}{1.363cm}}{\pgfqpoint{0.815cm}{1.399cm}}
\pgfusepath{fill}
\pgfpathmoveto{\pgfqpoint{1.345cm}{1.371cm}}
\pgfpathcurveto{\pgfqpoint{1.345cm}{1.408cm}}{\pgfqpoint{1.331cm}{1.442cm}}{\pgfqpoint{1.305cm}{1.468cm}}
\pgfpathcurveto{\pgfqpoint{1.28cm}{1.494cm}}{\pgfqpoint{1.245cm}{1.508cm}}{\pgfqpoint{1.209cm}{1.508cm}}
\pgfpathcurveto{\pgfqpoint{1.172cm}{1.508cm}}{\pgfqpoint{1.138cm}{1.494cm}}{\pgfqpoint{1.112cm}{1.468cm}}
\pgfpathcurveto{\pgfqpoint{1.087cm}{1.442cm}}{\pgfqpoint{1.072cm}{1.408cm}}{\pgfqpoint{1.072cm}{1.371cm}}
\pgfpathcurveto{\pgfqpoint{1.072cm}{1.335cm}}{\pgfqpoint{1.087cm}{1.3cm}}{\pgfqpoint{1.112cm}{1.274cm}}
\pgfpathcurveto{\pgfqpoint{1.138cm}{1.249cm}}{\pgfqpoint{1.172cm}{1.234cm}}{\pgfqpoint{1.209cm}{1.234cm}}
\pgfpathcurveto{\pgfqpoint{1.245cm}{1.234cm}}{\pgfqpoint{1.28cm}{1.249cm}}{\pgfqpoint{1.305cm}{1.274cm}}
\pgfpathcurveto{\pgfqpoint{1.331cm}{1.3cm}}{\pgfqpoint{1.345cm}{1.335cm}}{\pgfqpoint{1.345cm}{1.371cm}}
\pgfusepath{fill}
\begin{pgfscope}
\pgfsetdash{}{0cm}
\pgfsetlinewidth{0.818mm}
\pgfsetroundcap
\pgfsetmiterlimit{4.0}
\pgfpathmoveto{\pgfqpoint{0.682cm}{0.671cm}}
\pgfpathlineto{\pgfqpoint{0.682cm}{0.042cm}}
\pgfusepath{stroke}
\end{pgfscope}
\end{pgfscope}
\end{pgfscope}
\end{pgfscope}
\end{tikzpicture}}}}-\rmm{3\UU_>\llbracket X^2 \rrbracket\succ(\phi+\psi)}+\rmb{3\UU_\leq\llbracket X^2 \rrbracket\succ(\phi+\psi)}-3[\Q,(-X^{\!\resizebox{0.6em}{!}{
\begin{tikzpicture}
\pgfpathmoveto{\pgfqpoint{0cm}{-0.035cm}}
\pgfpathlineto{\pgfqpoint{1.376cm}{-0.035cm}}
\pgfpathlineto{\pgfqpoint{1.376cm}{1.552cm}}
\pgfpathlineto{\pgfqpoint{0cm}{1.552cm}}
\pgfpathclose
\pgfusepath{clip}
\begin{pgfscope}
\begin{pgfscope}
\pgfpathmoveto{\pgfqpoint{0cm}{-0.035cm}}
\pgfpathlineto{\pgfqpoint{1.376cm}{-0.035cm}}
\pgfpathlineto{\pgfqpoint{1.376cm}{1.552cm}}
\pgfpathlineto{\pgfqpoint{0cm}{1.552cm}}
\pgfpathclose
\pgfusepath{clip}
\begin{pgfscope}
\begin{pgfscope}
\pgfsetdash{}{0cm}
\pgfsetlinewidth{0.818mm}
\pgfsetroundcap
\pgfsetroundjoin
\pgfsetmiterlimit{7.0}
\definecolor{eps2pgf_color}{gray}{0}\pgfsetstrokecolor{eps2pgf_color}\pgfsetfillcolor{eps2pgf_color}
\pgfpathmoveto{\pgfqpoint{0.117cm}{1.421cm}}
\pgfpathlineto{\pgfqpoint{0.682cm}{0.671cm}}
\pgfpathlineto{\pgfqpoint{1.246cm}{1.421cm}}
\pgfusepath{stroke}
\end{pgfscope}
\definecolor{eps2pgf_color}{gray}{0}\pgfsetstrokecolor{eps2pgf_color}\pgfsetfillcolor{eps2pgf_color}
\pgfpathmoveto{\pgfqpoint{0.273cm}{1.395cm}}
\pgfpathcurveto{\pgfqpoint{0.273cm}{1.432cm}}{\pgfqpoint{0.259cm}{1.467cm}}{\pgfqpoint{0.233cm}{1.492cm}}
\pgfpathcurveto{\pgfqpoint{0.207cm}{1.518cm}}{\pgfqpoint{0.173cm}{1.532cm}}{\pgfqpoint{0.137cm}{1.532cm}}
\pgfpathcurveto{\pgfqpoint{0.1cm}{1.532cm}}{\pgfqpoint{0.066cm}{1.518cm}}{\pgfqpoint{0.04cm}{1.492cm}}
\pgfpathcurveto{\pgfqpoint{0.014cm}{1.467cm}}{\pgfqpoint{0cm}{1.432cm}}{\pgfqpoint{0cm}{1.395cm}}
\pgfpathcurveto{\pgfqpoint{0cm}{1.359cm}}{\pgfqpoint{0.014cm}{1.324cm}}{\pgfqpoint{0.04cm}{1.299cm}}
\pgfpathcurveto{\pgfqpoint{0.066cm}{1.273cm}}{\pgfqpoint{0.1cm}{1.258cm}}{\pgfqpoint{0.137cm}{1.258cm}}
\pgfpathcurveto{\pgfqpoint{0.173cm}{1.258cm}}{\pgfqpoint{0.207cm}{1.273cm}}{\pgfqpoint{0.233cm}{1.299cm}}
\pgfpathcurveto{\pgfqpoint{0.259cm}{1.324cm}}{\pgfqpoint{0.273cm}{1.359cm}}{\pgfqpoint{0.273cm}{1.395cm}}
\pgfusepath{fill}
\begin{pgfscope}
\pgfsetdash{}{0cm}
\pgfsetlinewidth{0.818mm}
\pgfsetmiterlimit{7.0}
\pgfpathmoveto{\pgfqpoint{0.682cm}{0.671cm}}
\pgfpathlineto{\pgfqpoint{0.679cm}{1.418cm}}
\pgfusepath{stroke}
\end{pgfscope}
\pgfpathmoveto{\pgfqpoint{0.815cm}{1.399cm}}
\pgfpathcurveto{\pgfqpoint{0.815cm}{1.435cm}}{\pgfqpoint{0.801cm}{1.47cm}}{\pgfqpoint{0.775cm}{1.496cm}}
\pgfpathcurveto{\pgfqpoint{0.75cm}{1.521cm}}{\pgfqpoint{0.715cm}{1.536cm}}{\pgfqpoint{0.679cm}{1.536cm}}
\pgfpathcurveto{\pgfqpoint{0.643cm}{1.536cm}}{\pgfqpoint{0.608cm}{1.521cm}}{\pgfqpoint{0.582cm}{1.496cm}}
\pgfpathcurveto{\pgfqpoint{0.557cm}{1.47cm}}{\pgfqpoint{0.542cm}{1.435cm}}{\pgfqpoint{0.542cm}{1.399cm}}
\pgfpathcurveto{\pgfqpoint{0.542cm}{1.363cm}}{\pgfqpoint{0.557cm}{1.328cm}}{\pgfqpoint{0.582cm}{1.302cm}}
\pgfpathcurveto{\pgfqpoint{0.608cm}{1.276cm}}{\pgfqpoint{0.643cm}{1.262cm}}{\pgfqpoint{0.679cm}{1.262cm}}
\pgfpathcurveto{\pgfqpoint{0.715cm}{1.262cm}}{\pgfqpoint{0.75cm}{1.276cm}}{\pgfqpoint{0.775cm}{1.302cm}}
\pgfpathcurveto{\pgfqpoint{0.801cm}{1.328cm}}{\pgfqpoint{0.815cm}{1.363cm}}{\pgfqpoint{0.815cm}{1.399cm}}
\pgfusepath{fill}
\pgfpathmoveto{\pgfqpoint{1.345cm}{1.371cm}}
\pgfpathcurveto{\pgfqpoint{1.345cm}{1.408cm}}{\pgfqpoint{1.331cm}{1.442cm}}{\pgfqpoint{1.305cm}{1.468cm}}
\pgfpathcurveto{\pgfqpoint{1.28cm}{1.494cm}}{\pgfqpoint{1.245cm}{1.508cm}}{\pgfqpoint{1.209cm}{1.508cm}}
\pgfpathcurveto{\pgfqpoint{1.172cm}{1.508cm}}{\pgfqpoint{1.138cm}{1.494cm}}{\pgfqpoint{1.112cm}{1.468cm}}
\pgfpathcurveto{\pgfqpoint{1.087cm}{1.442cm}}{\pgfqpoint{1.072cm}{1.408cm}}{\pgfqpoint{1.072cm}{1.371cm}}
\pgfpathcurveto{\pgfqpoint{1.072cm}{1.335cm}}{\pgfqpoint{1.087cm}{1.3cm}}{\pgfqpoint{1.112cm}{1.274cm}}
\pgfpathcurveto{\pgfqpoint{1.138cm}{1.249cm}}{\pgfqpoint{1.172cm}{1.234cm}}{\pgfqpoint{1.209cm}{1.234cm}}
\pgfpathcurveto{\pgfqpoint{1.245cm}{1.234cm}}{\pgfqpoint{1.28cm}{1.249cm}}{\pgfqpoint{1.305cm}{1.274cm}}
\pgfpathcurveto{\pgfqpoint{1.331cm}{1.3cm}}{\pgfqpoint{1.345cm}{1.335cm}}{\pgfqpoint{1.345cm}{1.371cm}}
\pgfusepath{fill}
\begin{pgfscope}
\pgfsetdash{}{0cm}
\pgfsetlinewidth{0.818mm}
\pgfsetroundcap
\pgfsetmiterlimit{4.0}
\pgfpathmoveto{\pgfqpoint{0.682cm}{0.671cm}}
\pgfpathlineto{\pgfqpoint{0.682cm}{0.042cm}}
\pgfusepath{stroke}
\end{pgfscope}
\end{pgfscope}
\end{pgfscope}
\end{pgfscope}
\end{tikzpicture}}}+\phi+\psi)\prec]X^{\!\resizebox{0.6em}{!}{
\begin{tikzpicture}
\pgfpathmoveto{\pgfqpoint{0cm}{0cm}}
\pgfpathlineto{\pgfqpoint{1.376cm}{0cm}}
\pgfpathlineto{\pgfqpoint{1.376cm}{1.588cm}}
\pgfpathlineto{\pgfqpoint{0cm}{1.588cm}}
\pgfpathclose
\pgfusepath{clip}
\begin{pgfscope}
\begin{pgfscope}
\pgfpathmoveto{\pgfqpoint{0cm}{0cm}}
\pgfpathlineto{\pgfqpoint{1.376cm}{0cm}}
\pgfpathlineto{\pgfqpoint{1.376cm}{1.588cm}}
\pgfpathlineto{\pgfqpoint{0cm}{1.588cm}}
\pgfpathclose
\pgfusepath{clip}
\begin{pgfscope}
\begin{pgfscope}
\definecolor{eps2pgf_color}{gray}{0.976471}\pgfsetstrokecolor{eps2pgf_color}\pgfsetfillcolor{eps2pgf_color}
\pgfpathmoveto{\pgfqpoint{0cm}{0cm}}
\pgfpathlineto{\pgfqpoint{1.376cm}{0cm}}
\pgfpathlineto{\pgfqpoint{1.376cm}{1.588cm}}
\pgfpathlineto{\pgfqpoint{0cm}{1.588cm}}
\pgfpathclose
\pgfusepath{fill}
\end{pgfscope}
\begin{pgfscope}
\pgfsetdash{}{0cm}
\pgfsetlinewidth{0.818mm}
\pgfsetroundcap
\pgfsetroundjoin
\pgfsetmiterlimit{7.0}
\definecolor{eps2pgf_color}{gray}{0}\pgfsetstrokecolor{eps2pgf_color}\pgfsetfillcolor{eps2pgf_color}
\pgfpathmoveto{\pgfqpoint{0.117cm}{1.476cm}}
\pgfpathlineto{\pgfqpoint{0.682cm}{0.726cm}}
\pgfpathlineto{\pgfqpoint{1.246cm}{1.476cm}}
\pgfusepath{stroke}
\end{pgfscope}
\definecolor{eps2pgf_color}{gray}{0}\pgfsetstrokecolor{eps2pgf_color}\pgfsetfillcolor{eps2pgf_color}
\pgfpathmoveto{\pgfqpoint{0.273cm}{1.451cm}}
\pgfpathcurveto{\pgfqpoint{0.273cm}{1.487cm}}{\pgfqpoint{0.259cm}{1.522cm}}{\pgfqpoint{0.233cm}{1.547cm}}
\pgfpathcurveto{\pgfqpoint{0.207cm}{1.573cm}}{\pgfqpoint{0.173cm}{1.588cm}}{\pgfqpoint{0.137cm}{1.588cm}}
\pgfpathcurveto{\pgfqpoint{0.1cm}{1.588cm}}{\pgfqpoint{0.066cm}{1.573cm}}{\pgfqpoint{0.04cm}{1.547cm}}
\pgfpathcurveto{\pgfqpoint{0.014cm}{1.522cm}}{\pgfqpoint{0cm}{1.487cm}}{\pgfqpoint{0cm}{1.451cm}}
\pgfpathcurveto{\pgfqpoint{0cm}{1.414cm}}{\pgfqpoint{0.014cm}{1.379cm}}{\pgfqpoint{0.04cm}{1.354cm}}
\pgfpathcurveto{\pgfqpoint{0.066cm}{1.328cm}}{\pgfqpoint{0.1cm}{1.314cm}}{\pgfqpoint{0.137cm}{1.314cm}}
\pgfpathcurveto{\pgfqpoint{0.173cm}{1.314cm}}{\pgfqpoint{0.207cm}{1.328cm}}{\pgfqpoint{0.233cm}{1.354cm}}
\pgfpathcurveto{\pgfqpoint{0.259cm}{1.379cm}}{\pgfqpoint{0.273cm}{1.414cm}}{\pgfqpoint{0.273cm}{1.451cm}}
\pgfusepath{fill}
\pgfpathmoveto{\pgfqpoint{1.345cm}{1.426cm}}
\pgfpathcurveto{\pgfqpoint{1.345cm}{1.463cm}}{\pgfqpoint{1.331cm}{1.497cm}}{\pgfqpoint{1.305cm}{1.523cm}}
\pgfpathcurveto{\pgfqpoint{1.28cm}{1.549cm}}{\pgfqpoint{1.245cm}{1.563cm}}{\pgfqpoint{1.209cm}{1.563cm}}
\pgfpathcurveto{\pgfqpoint{1.172cm}{1.563cm}}{\pgfqpoint{1.138cm}{1.549cm}}{\pgfqpoint{1.112cm}{1.523cm}}
\pgfpathcurveto{\pgfqpoint{1.087cm}{1.497cm}}{\pgfqpoint{1.072cm}{1.463cm}}{\pgfqpoint{1.072cm}{1.426cm}}
\pgfpathcurveto{\pgfqpoint{1.072cm}{1.39cm}}{\pgfqpoint{1.087cm}{1.355cm}}{\pgfqpoint{1.112cm}{1.329cm}}
\pgfpathcurveto{\pgfqpoint{1.138cm}{1.304cm}}{\pgfqpoint{1.172cm}{1.289cm}}{\pgfqpoint{1.209cm}{1.289cm}}
\pgfpathcurveto{\pgfqpoint{1.245cm}{1.289cm}}{\pgfqpoint{1.28cm}{1.304cm}}{\pgfqpoint{1.305cm}{1.329cm}}
\pgfpathcurveto{\pgfqpoint{1.331cm}{1.355cm}}{\pgfqpoint{1.345cm}{1.39cm}}{\pgfqpoint{1.345cm}{1.426cm}}
\pgfusepath{fill}
\begin{pgfscope}
\pgfsetdash{}{0cm}
\pgfsetlinewidth{0.818mm}
\pgfsetroundcap
\pgfsetmiterlimit{4.0}
\pgfpathmoveto{\pgfqpoint{0.682cm}{0.726cm}}
\pgfpathlineto{\pgfqpoint{0.682cm}{0.097cm}}
\pgfusepath{stroke}
\end{pgfscope}
\end{pgfscope}
\end{pgfscope}
\end{pgfscope}
\end{tikzpicture}}}.
$$
Next, we observe that all the remaining terms from $\Phi$ can be estimated in $\CC^{-1+\alpha}(\rho^{2+\alpha})$. Indeed, all the terms that do not contain $\phi+\psi$ are bounded in this space and for the terms containing $\phi+\psi$, we observe that
$$
\|\UU_> \llbracket X^2 \rrbracket\|_{\CC^{-\frac{3}{2}}(\rho^{\frac{1}{2}})}\lesssim 2^{-(\frac{1}{2}-\kappa)K} \|\llbracket X^2 \rrbracket\|_{\CC^{-1-\kappa}(\rho^\sigma)},
$$
$$
\|\UU_\leq\llbracket X^2 \rrbracket\|_{\CC^{-1+\alpha}(\rho^{1+\alpha})}\lesssim 2^{(\alpha+\kappa)K} \|\llbracket X^2 \rrbracket\|_{\CC^{-1-\kappa}(\rho^{\sigma})},
$$
$$
\|\UU_> X^{\!\resizebox{!}{.8em}{
\begin{tikzpicture}
\pgfpathmoveto{\pgfqpoint{0cm}{-0.035cm}}
\pgfpathlineto{\pgfqpoint{1.976cm}{-0.035cm}}
\pgfpathlineto{\pgfqpoint{1.976cm}{1.94cm}}
\pgfpathlineto{\pgfqpoint{0cm}{1.94cm}}
\pgfpathclose
\pgfusepath{clip}
\begin{pgfscope}
\begin{pgfscope}
\pgfpathmoveto{\pgfqpoint{0cm}{-0.035cm}}
\pgfpathlineto{\pgfqpoint{1.976cm}{-0.035cm}}
\pgfpathlineto{\pgfqpoint{1.976cm}{1.94cm}}
\pgfpathlineto{\pgfqpoint{0cm}{1.94cm}}
\pgfpathclose
\pgfusepath{clip}
\begin{pgfscope}
\begin{pgfscope}
\pgfsetdash{}{0cm}
\pgfsetlinewidth{0.818mm}
\pgfsetroundcap
\pgfsetroundjoin
\pgfsetmiterlimit{7.0}
\definecolor{eps2pgf_color}{gray}{0}\pgfsetstrokecolor{eps2pgf_color}\pgfsetfillcolor{eps2pgf_color}
\pgfpathmoveto{\pgfqpoint{0.117cm}{1.815cm}}
\pgfpathlineto{\pgfqpoint{0.682cm}{1.065cm}}
\pgfpathlineto{\pgfqpoint{1.246cm}{1.815cm}}
\pgfusepath{stroke}
\end{pgfscope}
\definecolor{eps2pgf_color}{gray}{0}\pgfsetstrokecolor{eps2pgf_color}\pgfsetfillcolor{eps2pgf_color}
\pgfpathmoveto{\pgfqpoint{0.273cm}{1.789cm}}
\pgfpathcurveto{\pgfqpoint{0.273cm}{1.825cm}}{\pgfqpoint{0.259cm}{1.86cm}}{\pgfqpoint{0.233cm}{1.886cm}}
\pgfpathcurveto{\pgfqpoint{0.207cm}{1.912cm}}{\pgfqpoint{0.173cm}{1.926cm}}{\pgfqpoint{0.137cm}{1.926cm}}
\pgfpathcurveto{\pgfqpoint{0.1cm}{1.926cm}}{\pgfqpoint{0.066cm}{1.912cm}}{\pgfqpoint{0.04cm}{1.886cm}}
\pgfpathcurveto{\pgfqpoint{0.014cm}{1.86cm}}{\pgfqpoint{0cm}{1.825cm}}{\pgfqpoint{0cm}{1.789cm}}
\pgfpathcurveto{\pgfqpoint{0cm}{1.753cm}}{\pgfqpoint{0.014cm}{1.718cm}}{\pgfqpoint{0.04cm}{1.692cm}}
\pgfpathcurveto{\pgfqpoint{0.066cm}{1.667cm}}{\pgfqpoint{0.1cm}{1.652cm}}{\pgfqpoint{0.137cm}{1.652cm}}
\pgfpathcurveto{\pgfqpoint{0.173cm}{1.652cm}}{\pgfqpoint{0.207cm}{1.667cm}}{\pgfqpoint{0.233cm}{1.692cm}}
\pgfpathcurveto{\pgfqpoint{0.259cm}{1.718cm}}{\pgfqpoint{0.273cm}{1.753cm}}{\pgfqpoint{0.273cm}{1.789cm}}
\pgfusepath{fill}
\pgfpathmoveto{\pgfqpoint{1.345cm}{1.765cm}}
\pgfpathcurveto{\pgfqpoint{1.345cm}{1.801cm}}{\pgfqpoint{1.331cm}{1.836cm}}{\pgfqpoint{1.305cm}{1.862cm}}
\pgfpathcurveto{\pgfqpoint{1.28cm}{1.887cm}}{\pgfqpoint{1.245cm}{1.902cm}}{\pgfqpoint{1.209cm}{1.902cm}}
\pgfpathcurveto{\pgfqpoint{1.172cm}{1.902cm}}{\pgfqpoint{1.138cm}{1.887cm}}{\pgfqpoint{1.112cm}{1.862cm}}
\pgfpathcurveto{\pgfqpoint{1.087cm}{1.836cm}}{\pgfqpoint{1.072cm}{1.801cm}}{\pgfqpoint{1.072cm}{1.765cm}}
\pgfpathcurveto{\pgfqpoint{1.072cm}{1.728cm}}{\pgfqpoint{1.087cm}{1.694cm}}{\pgfqpoint{1.112cm}{1.668cm}}
\pgfpathcurveto{\pgfqpoint{1.138cm}{1.642cm}}{\pgfqpoint{1.172cm}{1.628cm}}{\pgfqpoint{1.209cm}{1.628cm}}
\pgfpathcurveto{\pgfqpoint{1.245cm}{1.628cm}}{\pgfqpoint{1.28cm}{1.642cm}}{\pgfqpoint{1.305cm}{1.668cm}}
\pgfpathcurveto{\pgfqpoint{1.331cm}{1.694cm}}{\pgfqpoint{1.345cm}{1.728cm}}{\pgfqpoint{1.345cm}{1.765cm}}
\pgfusepath{fill}
\begin{pgfscope}
\pgfsetdash{}{0cm}
\pgfsetlinewidth{0.818mm}
\pgfsetroundcap
\pgfsetroundjoin
\pgfsetmiterlimit{7.0}
\pgfpathmoveto{\pgfqpoint{0.682cm}{1.065cm}}
\pgfpathlineto{\pgfqpoint{1.246cm}{0.315cm}}
\pgfpathlineto{\pgfqpoint{1.811cm}{1.065cm}}
\pgfusepath{stroke}
\end{pgfscope}
\pgfpathmoveto{\pgfqpoint{1.948cm}{1.065cm}}
\pgfpathcurveto{\pgfqpoint{1.948cm}{1.101cm}}{\pgfqpoint{1.933cm}{1.136cm}}{\pgfqpoint{1.907cm}{1.162cm}}
\pgfpathcurveto{\pgfqpoint{1.882cm}{1.187cm}}{\pgfqpoint{1.847cm}{1.202cm}}{\pgfqpoint{1.811cm}{1.202cm}}
\pgfpathcurveto{\pgfqpoint{1.775cm}{1.202cm}}{\pgfqpoint{1.74cm}{1.187cm}}{\pgfqpoint{1.714cm}{1.162cm}}
\pgfpathcurveto{\pgfqpoint{1.689cm}{1.136cm}}{\pgfqpoint{1.674cm}{1.101cm}}{\pgfqpoint{1.674cm}{1.065cm}}
\pgfpathcurveto{\pgfqpoint{1.674cm}{1.029cm}}{\pgfqpoint{1.689cm}{0.994cm}}{\pgfqpoint{1.714cm}{0.968cm}}
\pgfpathcurveto{\pgfqpoint{1.74cm}{0.942cm}}{\pgfqpoint{1.775cm}{0.928cm}}{\pgfqpoint{1.811cm}{0.928cm}}
\pgfpathcurveto{\pgfqpoint{1.847cm}{0.928cm}}{\pgfqpoint{1.882cm}{0.942cm}}{\pgfqpoint{1.907cm}{0.968cm}}
\pgfpathcurveto{\pgfqpoint{1.933cm}{0.994cm}}{\pgfqpoint{1.948cm}{1.029cm}}{\pgfqpoint{1.948cm}{1.065cm}}
\pgfusepath{fill}
\begin{pgfscope}
\pgfsetdash{}{0cm}
\pgfsetlinewidth{0.818mm}
\pgfsetmiterlimit{7.0}
\pgfpathmoveto{\pgfqpoint{1.246cm}{0.315cm}}
\pgfpathlineto{\pgfqpoint{1.244cm}{1.061cm}}
\pgfusepath{stroke}
\end{pgfscope}
\pgfpathmoveto{\pgfqpoint{1.38cm}{1.065cm}}
\pgfpathcurveto{\pgfqpoint{1.38cm}{1.101cm}}{\pgfqpoint{1.366cm}{1.136cm}}{\pgfqpoint{1.34cm}{1.162cm}}
\pgfpathcurveto{\pgfqpoint{1.315cm}{1.187cm}}{\pgfqpoint{1.28cm}{1.202cm}}{\pgfqpoint{1.244cm}{1.202cm}}
\pgfpathcurveto{\pgfqpoint{1.207cm}{1.202cm}}{\pgfqpoint{1.173cm}{1.187cm}}{\pgfqpoint{1.147cm}{1.162cm}}
\pgfpathcurveto{\pgfqpoint{1.121cm}{1.136cm}}{\pgfqpoint{1.107cm}{1.101cm}}{\pgfqpoint{1.107cm}{1.065cm}}
\pgfpathcurveto{\pgfqpoint{1.107cm}{1.029cm}}{\pgfqpoint{1.121cm}{0.994cm}}{\pgfqpoint{1.147cm}{0.968cm}}
\pgfpathcurveto{\pgfqpoint{1.173cm}{0.942cm}}{\pgfqpoint{1.207cm}{0.928cm}}{\pgfqpoint{1.244cm}{0.928cm}}
\pgfpathcurveto{\pgfqpoint{1.28cm}{0.928cm}}{\pgfqpoint{1.315cm}{0.942cm}}{\pgfqpoint{1.34cm}{0.968cm}}
\pgfpathcurveto{\pgfqpoint{1.366cm}{0.994cm}}{\pgfqpoint{1.38cm}{1.029cm}}{\pgfqpoint{1.38cm}{1.065cm}}
\pgfusepath{fill}
\begin{pgfscope}
\pgfsetdash{}{0cm}
\pgfsetlinewidth{0.818mm}
\pgfsetmiterlimit{4.0}
\pgfpathmoveto{\pgfqpoint{1.383cm}{0.178cm}}
\pgfpathcurveto{\pgfqpoint{1.383cm}{0.214cm}}{\pgfqpoint{1.369cm}{0.249cm}}{\pgfqpoint{1.343cm}{0.275cm}}
\pgfpathcurveto{\pgfqpoint{1.317cm}{0.3cm}}{\pgfqpoint{1.283cm}{0.315cm}}{\pgfqpoint{1.246cm}{0.315cm}}
\pgfpathcurveto{\pgfqpoint{1.21cm}{0.315cm}}{\pgfqpoint{1.175cm}{0.3cm}}{\pgfqpoint{1.15cm}{0.275cm}}
\pgfpathcurveto{\pgfqpoint{1.124cm}{0.249cm}}{\pgfqpoint{1.11cm}{0.214cm}}{\pgfqpoint{1.11cm}{0.178cm}}
\pgfpathcurveto{\pgfqpoint{1.11cm}{0.141cm}}{\pgfqpoint{1.124cm}{0.107cm}}{\pgfqpoint{1.15cm}{0.081cm}}
\pgfpathcurveto{\pgfqpoint{1.175cm}{0.055cm}}{\pgfqpoint{1.21cm}{0.041cm}}{\pgfqpoint{1.246cm}{0.041cm}}
\pgfpathcurveto{\pgfqpoint{1.283cm}{0.041cm}}{\pgfqpoint{1.317cm}{0.055cm}}{\pgfqpoint{1.343cm}{0.081cm}}
\pgfpathcurveto{\pgfqpoint{1.369cm}{0.107cm}}{\pgfqpoint{1.383cm}{0.141cm}}{\pgfqpoint{1.383cm}{0.178cm}}
\pgfusepath{stroke}
\end{pgfscope}
\end{pgfscope}
\end{pgfscope}
\end{pgfscope}
\end{tikzpicture}}}\|_{\CC^{-1+\alpha}}\lesssim 2^{-(1-\alpha-\kappa)K/2} \|X^{\!\resizebox{!}{.8em}{
\begin{tikzpicture}
\pgfpathmoveto{\pgfqpoint{0cm}{-0.035cm}}
\pgfpathlineto{\pgfqpoint{1.976cm}{-0.035cm}}
\pgfpathlineto{\pgfqpoint{1.976cm}{1.94cm}}
\pgfpathlineto{\pgfqpoint{0cm}{1.94cm}}
\pgfpathclose
\pgfusepath{clip}
\begin{pgfscope}
\begin{pgfscope}
\pgfpathmoveto{\pgfqpoint{0cm}{-0.035cm}}
\pgfpathlineto{\pgfqpoint{1.976cm}{-0.035cm}}
\pgfpathlineto{\pgfqpoint{1.976cm}{1.94cm}}
\pgfpathlineto{\pgfqpoint{0cm}{1.94cm}}
\pgfpathclose
\pgfusepath{clip}
\begin{pgfscope}
\begin{pgfscope}
\pgfsetdash{}{0cm}
\pgfsetlinewidth{0.818mm}
\pgfsetroundcap
\pgfsetroundjoin
\pgfsetmiterlimit{7.0}
\definecolor{eps2pgf_color}{gray}{0}\pgfsetstrokecolor{eps2pgf_color}\pgfsetfillcolor{eps2pgf_color}
\pgfpathmoveto{\pgfqpoint{0.117cm}{1.815cm}}
\pgfpathlineto{\pgfqpoint{0.682cm}{1.065cm}}
\pgfpathlineto{\pgfqpoint{1.246cm}{1.815cm}}
\pgfusepath{stroke}
\end{pgfscope}
\definecolor{eps2pgf_color}{gray}{0}\pgfsetstrokecolor{eps2pgf_color}\pgfsetfillcolor{eps2pgf_color}
\pgfpathmoveto{\pgfqpoint{0.273cm}{1.789cm}}
\pgfpathcurveto{\pgfqpoint{0.273cm}{1.825cm}}{\pgfqpoint{0.259cm}{1.86cm}}{\pgfqpoint{0.233cm}{1.886cm}}
\pgfpathcurveto{\pgfqpoint{0.207cm}{1.912cm}}{\pgfqpoint{0.173cm}{1.926cm}}{\pgfqpoint{0.137cm}{1.926cm}}
\pgfpathcurveto{\pgfqpoint{0.1cm}{1.926cm}}{\pgfqpoint{0.066cm}{1.912cm}}{\pgfqpoint{0.04cm}{1.886cm}}
\pgfpathcurveto{\pgfqpoint{0.014cm}{1.86cm}}{\pgfqpoint{0cm}{1.825cm}}{\pgfqpoint{0cm}{1.789cm}}
\pgfpathcurveto{\pgfqpoint{0cm}{1.753cm}}{\pgfqpoint{0.014cm}{1.718cm}}{\pgfqpoint{0.04cm}{1.692cm}}
\pgfpathcurveto{\pgfqpoint{0.066cm}{1.667cm}}{\pgfqpoint{0.1cm}{1.652cm}}{\pgfqpoint{0.137cm}{1.652cm}}
\pgfpathcurveto{\pgfqpoint{0.173cm}{1.652cm}}{\pgfqpoint{0.207cm}{1.667cm}}{\pgfqpoint{0.233cm}{1.692cm}}
\pgfpathcurveto{\pgfqpoint{0.259cm}{1.718cm}}{\pgfqpoint{0.273cm}{1.753cm}}{\pgfqpoint{0.273cm}{1.789cm}}
\pgfusepath{fill}
\pgfpathmoveto{\pgfqpoint{1.345cm}{1.765cm}}
\pgfpathcurveto{\pgfqpoint{1.345cm}{1.801cm}}{\pgfqpoint{1.331cm}{1.836cm}}{\pgfqpoint{1.305cm}{1.862cm}}
\pgfpathcurveto{\pgfqpoint{1.28cm}{1.887cm}}{\pgfqpoint{1.245cm}{1.902cm}}{\pgfqpoint{1.209cm}{1.902cm}}
\pgfpathcurveto{\pgfqpoint{1.172cm}{1.902cm}}{\pgfqpoint{1.138cm}{1.887cm}}{\pgfqpoint{1.112cm}{1.862cm}}
\pgfpathcurveto{\pgfqpoint{1.087cm}{1.836cm}}{\pgfqpoint{1.072cm}{1.801cm}}{\pgfqpoint{1.072cm}{1.765cm}}
\pgfpathcurveto{\pgfqpoint{1.072cm}{1.728cm}}{\pgfqpoint{1.087cm}{1.694cm}}{\pgfqpoint{1.112cm}{1.668cm}}
\pgfpathcurveto{\pgfqpoint{1.138cm}{1.642cm}}{\pgfqpoint{1.172cm}{1.628cm}}{\pgfqpoint{1.209cm}{1.628cm}}
\pgfpathcurveto{\pgfqpoint{1.245cm}{1.628cm}}{\pgfqpoint{1.28cm}{1.642cm}}{\pgfqpoint{1.305cm}{1.668cm}}
\pgfpathcurveto{\pgfqpoint{1.331cm}{1.694cm}}{\pgfqpoint{1.345cm}{1.728cm}}{\pgfqpoint{1.345cm}{1.765cm}}
\pgfusepath{fill}
\begin{pgfscope}
\pgfsetdash{}{0cm}
\pgfsetlinewidth{0.818mm}
\pgfsetroundcap
\pgfsetroundjoin
\pgfsetmiterlimit{7.0}
\pgfpathmoveto{\pgfqpoint{0.682cm}{1.065cm}}
\pgfpathlineto{\pgfqpoint{1.246cm}{0.315cm}}
\pgfpathlineto{\pgfqpoint{1.811cm}{1.065cm}}
\pgfusepath{stroke}
\end{pgfscope}
\pgfpathmoveto{\pgfqpoint{1.948cm}{1.065cm}}
\pgfpathcurveto{\pgfqpoint{1.948cm}{1.101cm}}{\pgfqpoint{1.933cm}{1.136cm}}{\pgfqpoint{1.907cm}{1.162cm}}
\pgfpathcurveto{\pgfqpoint{1.882cm}{1.187cm}}{\pgfqpoint{1.847cm}{1.202cm}}{\pgfqpoint{1.811cm}{1.202cm}}
\pgfpathcurveto{\pgfqpoint{1.775cm}{1.202cm}}{\pgfqpoint{1.74cm}{1.187cm}}{\pgfqpoint{1.714cm}{1.162cm}}
\pgfpathcurveto{\pgfqpoint{1.689cm}{1.136cm}}{\pgfqpoint{1.674cm}{1.101cm}}{\pgfqpoint{1.674cm}{1.065cm}}
\pgfpathcurveto{\pgfqpoint{1.674cm}{1.029cm}}{\pgfqpoint{1.689cm}{0.994cm}}{\pgfqpoint{1.714cm}{0.968cm}}
\pgfpathcurveto{\pgfqpoint{1.74cm}{0.942cm}}{\pgfqpoint{1.775cm}{0.928cm}}{\pgfqpoint{1.811cm}{0.928cm}}
\pgfpathcurveto{\pgfqpoint{1.847cm}{0.928cm}}{\pgfqpoint{1.882cm}{0.942cm}}{\pgfqpoint{1.907cm}{0.968cm}}
\pgfpathcurveto{\pgfqpoint{1.933cm}{0.994cm}}{\pgfqpoint{1.948cm}{1.029cm}}{\pgfqpoint{1.948cm}{1.065cm}}
\pgfusepath{fill}
\begin{pgfscope}
\pgfsetdash{}{0cm}
\pgfsetlinewidth{0.818mm}
\pgfsetmiterlimit{7.0}
\pgfpathmoveto{\pgfqpoint{1.246cm}{0.315cm}}
\pgfpathlineto{\pgfqpoint{1.244cm}{1.061cm}}
\pgfusepath{stroke}
\end{pgfscope}
\pgfpathmoveto{\pgfqpoint{1.38cm}{1.065cm}}
\pgfpathcurveto{\pgfqpoint{1.38cm}{1.101cm}}{\pgfqpoint{1.366cm}{1.136cm}}{\pgfqpoint{1.34cm}{1.162cm}}
\pgfpathcurveto{\pgfqpoint{1.315cm}{1.187cm}}{\pgfqpoint{1.28cm}{1.202cm}}{\pgfqpoint{1.244cm}{1.202cm}}
\pgfpathcurveto{\pgfqpoint{1.207cm}{1.202cm}}{\pgfqpoint{1.173cm}{1.187cm}}{\pgfqpoint{1.147cm}{1.162cm}}
\pgfpathcurveto{\pgfqpoint{1.121cm}{1.136cm}}{\pgfqpoint{1.107cm}{1.101cm}}{\pgfqpoint{1.107cm}{1.065cm}}
\pgfpathcurveto{\pgfqpoint{1.107cm}{1.029cm}}{\pgfqpoint{1.121cm}{0.994cm}}{\pgfqpoint{1.147cm}{0.968cm}}
\pgfpathcurveto{\pgfqpoint{1.173cm}{0.942cm}}{\pgfqpoint{1.207cm}{0.928cm}}{\pgfqpoint{1.244cm}{0.928cm}}
\pgfpathcurveto{\pgfqpoint{1.28cm}{0.928cm}}{\pgfqpoint{1.315cm}{0.942cm}}{\pgfqpoint{1.34cm}{0.968cm}}
\pgfpathcurveto{\pgfqpoint{1.366cm}{0.994cm}}{\pgfqpoint{1.38cm}{1.029cm}}{\pgfqpoint{1.38cm}{1.065cm}}
\pgfusepath{fill}
\begin{pgfscope}
\pgfsetdash{}{0cm}
\pgfsetlinewidth{0.818mm}
\pgfsetmiterlimit{4.0}
\pgfpathmoveto{\pgfqpoint{1.383cm}{0.178cm}}
\pgfpathcurveto{\pgfqpoint{1.383cm}{0.214cm}}{\pgfqpoint{1.369cm}{0.249cm}}{\pgfqpoint{1.343cm}{0.275cm}}
\pgfpathcurveto{\pgfqpoint{1.317cm}{0.3cm}}{\pgfqpoint{1.283cm}{0.315cm}}{\pgfqpoint{1.246cm}{0.315cm}}
\pgfpathcurveto{\pgfqpoint{1.21cm}{0.315cm}}{\pgfqpoint{1.175cm}{0.3cm}}{\pgfqpoint{1.15cm}{0.275cm}}
\pgfpathcurveto{\pgfqpoint{1.124cm}{0.249cm}}{\pgfqpoint{1.11cm}{0.214cm}}{\pgfqpoint{1.11cm}{0.178cm}}
\pgfpathcurveto{\pgfqpoint{1.11cm}{0.141cm}}{\pgfqpoint{1.124cm}{0.107cm}}{\pgfqpoint{1.15cm}{0.081cm}}
\pgfpathcurveto{\pgfqpoint{1.175cm}{0.055cm}}{\pgfqpoint{1.21cm}{0.041cm}}{\pgfqpoint{1.246cm}{0.041cm}}
\pgfpathcurveto{\pgfqpoint{1.283cm}{0.041cm}}{\pgfqpoint{1.317cm}{0.055cm}}{\pgfqpoint{1.343cm}{0.081cm}}
\pgfpathcurveto{\pgfqpoint{1.369cm}{0.107cm}}{\pgfqpoint{1.383cm}{0.141cm}}{\pgfqpoint{1.383cm}{0.178cm}}
\pgfusepath{stroke}
\end{pgfscope}
\end{pgfscope}
\end{pgfscope}
\end{pgfscope}
\end{tikzpicture}}} \|_{\CC^{-\kappa}(\rho^\sigma)},
$$
$$
\|\UU_> X\|_{\CC^{-1+\alpha}(\rho^{-\alpha})}\lesssim 2^{-(\frac{1}{2}-\alpha-\kappa)\frac{2}{3}K} \|X\|_{\CC^{-\frac{1}{2}-\kappa}(\rho^\sigma)},
$$
$$
\|\UU_> X^{\!\resizebox{!}{.8em}{
\begin{tikzpicture}
\pgfpathmoveto{\pgfqpoint{0cm}{-0.035cm}}
\pgfpathlineto{\pgfqpoint{1.976cm}{-0.035cm}}
\pgfpathlineto{\pgfqpoint{1.976cm}{1.94cm}}
\pgfpathlineto{\pgfqpoint{0cm}{1.94cm}}
\pgfpathclose
\pgfusepath{clip}
\begin{pgfscope}
\begin{pgfscope}
\pgfpathmoveto{\pgfqpoint{0cm}{-0.035cm}}
\pgfpathlineto{\pgfqpoint{1.976cm}{-0.035cm}}
\pgfpathlineto{\pgfqpoint{1.976cm}{1.94cm}}
\pgfpathlineto{\pgfqpoint{0cm}{1.94cm}}
\pgfpathclose
\pgfusepath{clip}
\begin{pgfscope}
\begin{pgfscope}
\pgfsetdash{}{0cm}
\pgfsetlinewidth{0.818mm}
\pgfsetroundcap
\pgfsetroundjoin
\pgfsetmiterlimit{7.0}
\definecolor{eps2pgf_color}{gray}{0}\pgfsetstrokecolor{eps2pgf_color}\pgfsetfillcolor{eps2pgf_color}
\pgfpathmoveto{\pgfqpoint{0.117cm}{1.815cm}}
\pgfpathlineto{\pgfqpoint{0.682cm}{1.065cm}}
\pgfpathlineto{\pgfqpoint{1.246cm}{1.815cm}}
\pgfusepath{stroke}
\end{pgfscope}
\definecolor{eps2pgf_color}{gray}{0}\pgfsetstrokecolor{eps2pgf_color}\pgfsetfillcolor{eps2pgf_color}
\pgfpathmoveto{\pgfqpoint{0.273cm}{1.789cm}}
\pgfpathcurveto{\pgfqpoint{0.273cm}{1.825cm}}{\pgfqpoint{0.259cm}{1.86cm}}{\pgfqpoint{0.233cm}{1.886cm}}
\pgfpathcurveto{\pgfqpoint{0.207cm}{1.912cm}}{\pgfqpoint{0.173cm}{1.926cm}}{\pgfqpoint{0.137cm}{1.926cm}}
\pgfpathcurveto{\pgfqpoint{0.1cm}{1.926cm}}{\pgfqpoint{0.066cm}{1.912cm}}{\pgfqpoint{0.04cm}{1.886cm}}
\pgfpathcurveto{\pgfqpoint{0.014cm}{1.86cm}}{\pgfqpoint{0cm}{1.825cm}}{\pgfqpoint{0cm}{1.789cm}}
\pgfpathcurveto{\pgfqpoint{0cm}{1.753cm}}{\pgfqpoint{0.014cm}{1.718cm}}{\pgfqpoint{0.04cm}{1.692cm}}
\pgfpathcurveto{\pgfqpoint{0.066cm}{1.667cm}}{\pgfqpoint{0.1cm}{1.652cm}}{\pgfqpoint{0.137cm}{1.652cm}}
\pgfpathcurveto{\pgfqpoint{0.173cm}{1.652cm}}{\pgfqpoint{0.207cm}{1.667cm}}{\pgfqpoint{0.233cm}{1.692cm}}
\pgfpathcurveto{\pgfqpoint{0.259cm}{1.718cm}}{\pgfqpoint{0.273cm}{1.753cm}}{\pgfqpoint{0.273cm}{1.789cm}}
\pgfusepath{fill}
\begin{pgfscope}
\pgfsetdash{}{0cm}
\pgfsetlinewidth{0.818mm}
\pgfsetmiterlimit{7.0}
\pgfpathmoveto{\pgfqpoint{0.682cm}{1.065cm}}
\pgfpathlineto{\pgfqpoint{0.679cm}{1.812cm}}
\pgfusepath{stroke}
\end{pgfscope}
\pgfpathmoveto{\pgfqpoint{0.815cm}{1.793cm}}
\pgfpathcurveto{\pgfqpoint{0.815cm}{1.829cm}}{\pgfqpoint{0.801cm}{1.864cm}}{\pgfqpoint{0.775cm}{1.89cm}}
\pgfpathcurveto{\pgfqpoint{0.75cm}{1.915cm}}{\pgfqpoint{0.715cm}{1.93cm}}{\pgfqpoint{0.679cm}{1.93cm}}
\pgfpathcurveto{\pgfqpoint{0.643cm}{1.93cm}}{\pgfqpoint{0.608cm}{1.915cm}}{\pgfqpoint{0.582cm}{1.89cm}}
\pgfpathcurveto{\pgfqpoint{0.557cm}{1.864cm}}{\pgfqpoint{0.542cm}{1.829cm}}{\pgfqpoint{0.542cm}{1.793cm}}
\pgfpathcurveto{\pgfqpoint{0.542cm}{1.756cm}}{\pgfqpoint{0.557cm}{1.722cm}}{\pgfqpoint{0.582cm}{1.696cm}}
\pgfpathcurveto{\pgfqpoint{0.608cm}{1.67cm}}{\pgfqpoint{0.643cm}{1.656cm}}{\pgfqpoint{0.679cm}{1.656cm}}
\pgfpathcurveto{\pgfqpoint{0.715cm}{1.656cm}}{\pgfqpoint{0.75cm}{1.67cm}}{\pgfqpoint{0.775cm}{1.696cm}}
\pgfpathcurveto{\pgfqpoint{0.801cm}{1.722cm}}{\pgfqpoint{0.815cm}{1.756cm}}{\pgfqpoint{0.815cm}{1.793cm}}
\pgfusepath{fill}
\pgfpathmoveto{\pgfqpoint{1.345cm}{1.765cm}}
\pgfpathcurveto{\pgfqpoint{1.345cm}{1.801cm}}{\pgfqpoint{1.331cm}{1.836cm}}{\pgfqpoint{1.305cm}{1.862cm}}
\pgfpathcurveto{\pgfqpoint{1.28cm}{1.887cm}}{\pgfqpoint{1.245cm}{1.902cm}}{\pgfqpoint{1.209cm}{1.902cm}}
\pgfpathcurveto{\pgfqpoint{1.172cm}{1.902cm}}{\pgfqpoint{1.138cm}{1.887cm}}{\pgfqpoint{1.112cm}{1.862cm}}
\pgfpathcurveto{\pgfqpoint{1.087cm}{1.836cm}}{\pgfqpoint{1.072cm}{1.801cm}}{\pgfqpoint{1.072cm}{1.765cm}}
\pgfpathcurveto{\pgfqpoint{1.072cm}{1.728cm}}{\pgfqpoint{1.087cm}{1.694cm}}{\pgfqpoint{1.112cm}{1.668cm}}
\pgfpathcurveto{\pgfqpoint{1.138cm}{1.642cm}}{\pgfqpoint{1.172cm}{1.628cm}}{\pgfqpoint{1.209cm}{1.628cm}}
\pgfpathcurveto{\pgfqpoint{1.245cm}{1.628cm}}{\pgfqpoint{1.28cm}{1.642cm}}{\pgfqpoint{1.305cm}{1.668cm}}
\pgfpathcurveto{\pgfqpoint{1.331cm}{1.694cm}}{\pgfqpoint{1.345cm}{1.728cm}}{\pgfqpoint{1.345cm}{1.765cm}}
\pgfusepath{fill}
\begin{pgfscope}
\pgfsetdash{}{0cm}
\pgfsetlinewidth{0.818mm}
\pgfsetroundcap
\pgfsetroundjoin
\pgfsetmiterlimit{7.0}
\pgfpathmoveto{\pgfqpoint{0.682cm}{1.065cm}}
\pgfpathlineto{\pgfqpoint{1.246cm}{0.315cm}}
\pgfpathlineto{\pgfqpoint{1.811cm}{1.065cm}}
\pgfusepath{stroke}
\end{pgfscope}
\pgfpathmoveto{\pgfqpoint{1.948cm}{1.065cm}}
\pgfpathcurveto{\pgfqpoint{1.948cm}{1.101cm}}{\pgfqpoint{1.933cm}{1.136cm}}{\pgfqpoint{1.907cm}{1.162cm}}
\pgfpathcurveto{\pgfqpoint{1.882cm}{1.187cm}}{\pgfqpoint{1.847cm}{1.202cm}}{\pgfqpoint{1.811cm}{1.202cm}}
\pgfpathcurveto{\pgfqpoint{1.775cm}{1.202cm}}{\pgfqpoint{1.74cm}{1.187cm}}{\pgfqpoint{1.714cm}{1.162cm}}
\pgfpathcurveto{\pgfqpoint{1.689cm}{1.136cm}}{\pgfqpoint{1.674cm}{1.101cm}}{\pgfqpoint{1.674cm}{1.065cm}}
\pgfpathcurveto{\pgfqpoint{1.674cm}{1.029cm}}{\pgfqpoint{1.689cm}{0.994cm}}{\pgfqpoint{1.714cm}{0.968cm}}
\pgfpathcurveto{\pgfqpoint{1.74cm}{0.942cm}}{\pgfqpoint{1.775cm}{0.928cm}}{\pgfqpoint{1.811cm}{0.928cm}}
\pgfpathcurveto{\pgfqpoint{1.847cm}{0.928cm}}{\pgfqpoint{1.882cm}{0.942cm}}{\pgfqpoint{1.907cm}{0.968cm}}
\pgfpathcurveto{\pgfqpoint{1.933cm}{0.994cm}}{\pgfqpoint{1.948cm}{1.029cm}}{\pgfqpoint{1.948cm}{1.065cm}}
\pgfusepath{fill}
\begin{pgfscope}
\pgfsetdash{}{0cm}
\pgfsetlinewidth{0.818mm}
\pgfsetmiterlimit{4.0}
\pgfpathmoveto{\pgfqpoint{1.383cm}{0.178cm}}
\pgfpathcurveto{\pgfqpoint{1.383cm}{0.214cm}}{\pgfqpoint{1.369cm}{0.249cm}}{\pgfqpoint{1.343cm}{0.275cm}}
\pgfpathcurveto{\pgfqpoint{1.317cm}{0.3cm}}{\pgfqpoint{1.283cm}{0.315cm}}{\pgfqpoint{1.246cm}{0.315cm}}
\pgfpathcurveto{\pgfqpoint{1.21cm}{0.315cm}}{\pgfqpoint{1.175cm}{0.3cm}}{\pgfqpoint{1.15cm}{0.275cm}}
\pgfpathcurveto{\pgfqpoint{1.124cm}{0.249cm}}{\pgfqpoint{1.11cm}{0.214cm}}{\pgfqpoint{1.11cm}{0.178cm}}
\pgfpathcurveto{\pgfqpoint{1.11cm}{0.141cm}}{\pgfqpoint{1.124cm}{0.107cm}}{\pgfqpoint{1.15cm}{0.081cm}}
\pgfpathcurveto{\pgfqpoint{1.175cm}{0.055cm}}{\pgfqpoint{1.21cm}{0.041cm}}{\pgfqpoint{1.246cm}{0.041cm}}
\pgfpathcurveto{\pgfqpoint{1.283cm}{0.041cm}}{\pgfqpoint{1.317cm}{0.055cm}}{\pgfqpoint{1.343cm}{0.081cm}}
\pgfpathcurveto{\pgfqpoint{1.369cm}{0.107cm}}{\pgfqpoint{1.383cm}{0.141cm}}{\pgfqpoint{1.383cm}{0.178cm}}
\pgfusepath{stroke}
\end{pgfscope}
\end{pgfscope}
\end{pgfscope}
\end{pgfscope}
\end{tikzpicture}}}\|_{\CC^{-1+\alpha}}\lesssim 2^{-(1-\alpha-\kappa)K/2} \|X^{\!\resizebox{!}{.8em}{
\begin{tikzpicture}
\pgfpathmoveto{\pgfqpoint{0cm}{-0.035cm}}
\pgfpathlineto{\pgfqpoint{1.976cm}{-0.035cm}}
\pgfpathlineto{\pgfqpoint{1.976cm}{1.94cm}}
\pgfpathlineto{\pgfqpoint{0cm}{1.94cm}}
\pgfpathclose
\pgfusepath{clip}
\begin{pgfscope}
\begin{pgfscope}
\pgfpathmoveto{\pgfqpoint{0cm}{-0.035cm}}
\pgfpathlineto{\pgfqpoint{1.976cm}{-0.035cm}}
\pgfpathlineto{\pgfqpoint{1.976cm}{1.94cm}}
\pgfpathlineto{\pgfqpoint{0cm}{1.94cm}}
\pgfpathclose
\pgfusepath{clip}
\begin{pgfscope}
\begin{pgfscope}
\pgfsetdash{}{0cm}
\pgfsetlinewidth{0.818mm}
\pgfsetroundcap
\pgfsetroundjoin
\pgfsetmiterlimit{7.0}
\definecolor{eps2pgf_color}{gray}{0}\pgfsetstrokecolor{eps2pgf_color}\pgfsetfillcolor{eps2pgf_color}
\pgfpathmoveto{\pgfqpoint{0.117cm}{1.815cm}}
\pgfpathlineto{\pgfqpoint{0.682cm}{1.065cm}}
\pgfpathlineto{\pgfqpoint{1.246cm}{1.815cm}}
\pgfusepath{stroke}
\end{pgfscope}
\definecolor{eps2pgf_color}{gray}{0}\pgfsetstrokecolor{eps2pgf_color}\pgfsetfillcolor{eps2pgf_color}
\pgfpathmoveto{\pgfqpoint{0.273cm}{1.789cm}}
\pgfpathcurveto{\pgfqpoint{0.273cm}{1.825cm}}{\pgfqpoint{0.259cm}{1.86cm}}{\pgfqpoint{0.233cm}{1.886cm}}
\pgfpathcurveto{\pgfqpoint{0.207cm}{1.912cm}}{\pgfqpoint{0.173cm}{1.926cm}}{\pgfqpoint{0.137cm}{1.926cm}}
\pgfpathcurveto{\pgfqpoint{0.1cm}{1.926cm}}{\pgfqpoint{0.066cm}{1.912cm}}{\pgfqpoint{0.04cm}{1.886cm}}
\pgfpathcurveto{\pgfqpoint{0.014cm}{1.86cm}}{\pgfqpoint{0cm}{1.825cm}}{\pgfqpoint{0cm}{1.789cm}}
\pgfpathcurveto{\pgfqpoint{0cm}{1.753cm}}{\pgfqpoint{0.014cm}{1.718cm}}{\pgfqpoint{0.04cm}{1.692cm}}
\pgfpathcurveto{\pgfqpoint{0.066cm}{1.667cm}}{\pgfqpoint{0.1cm}{1.652cm}}{\pgfqpoint{0.137cm}{1.652cm}}
\pgfpathcurveto{\pgfqpoint{0.173cm}{1.652cm}}{\pgfqpoint{0.207cm}{1.667cm}}{\pgfqpoint{0.233cm}{1.692cm}}
\pgfpathcurveto{\pgfqpoint{0.259cm}{1.718cm}}{\pgfqpoint{0.273cm}{1.753cm}}{\pgfqpoint{0.273cm}{1.789cm}}
\pgfusepath{fill}
\begin{pgfscope}
\pgfsetdash{}{0cm}
\pgfsetlinewidth{0.818mm}
\pgfsetmiterlimit{7.0}
\pgfpathmoveto{\pgfqpoint{0.682cm}{1.065cm}}
\pgfpathlineto{\pgfqpoint{0.679cm}{1.812cm}}
\pgfusepath{stroke}
\end{pgfscope}
\pgfpathmoveto{\pgfqpoint{0.815cm}{1.793cm}}
\pgfpathcurveto{\pgfqpoint{0.815cm}{1.829cm}}{\pgfqpoint{0.801cm}{1.864cm}}{\pgfqpoint{0.775cm}{1.89cm}}
\pgfpathcurveto{\pgfqpoint{0.75cm}{1.915cm}}{\pgfqpoint{0.715cm}{1.93cm}}{\pgfqpoint{0.679cm}{1.93cm}}
\pgfpathcurveto{\pgfqpoint{0.643cm}{1.93cm}}{\pgfqpoint{0.608cm}{1.915cm}}{\pgfqpoint{0.582cm}{1.89cm}}
\pgfpathcurveto{\pgfqpoint{0.557cm}{1.864cm}}{\pgfqpoint{0.542cm}{1.829cm}}{\pgfqpoint{0.542cm}{1.793cm}}
\pgfpathcurveto{\pgfqpoint{0.542cm}{1.756cm}}{\pgfqpoint{0.557cm}{1.722cm}}{\pgfqpoint{0.582cm}{1.696cm}}
\pgfpathcurveto{\pgfqpoint{0.608cm}{1.67cm}}{\pgfqpoint{0.643cm}{1.656cm}}{\pgfqpoint{0.679cm}{1.656cm}}
\pgfpathcurveto{\pgfqpoint{0.715cm}{1.656cm}}{\pgfqpoint{0.75cm}{1.67cm}}{\pgfqpoint{0.775cm}{1.696cm}}
\pgfpathcurveto{\pgfqpoint{0.801cm}{1.722cm}}{\pgfqpoint{0.815cm}{1.756cm}}{\pgfqpoint{0.815cm}{1.793cm}}
\pgfusepath{fill}
\pgfpathmoveto{\pgfqpoint{1.345cm}{1.765cm}}
\pgfpathcurveto{\pgfqpoint{1.345cm}{1.801cm}}{\pgfqpoint{1.331cm}{1.836cm}}{\pgfqpoint{1.305cm}{1.862cm}}
\pgfpathcurveto{\pgfqpoint{1.28cm}{1.887cm}}{\pgfqpoint{1.245cm}{1.902cm}}{\pgfqpoint{1.209cm}{1.902cm}}
\pgfpathcurveto{\pgfqpoint{1.172cm}{1.902cm}}{\pgfqpoint{1.138cm}{1.887cm}}{\pgfqpoint{1.112cm}{1.862cm}}
\pgfpathcurveto{\pgfqpoint{1.087cm}{1.836cm}}{\pgfqpoint{1.072cm}{1.801cm}}{\pgfqpoint{1.072cm}{1.765cm}}
\pgfpathcurveto{\pgfqpoint{1.072cm}{1.728cm}}{\pgfqpoint{1.087cm}{1.694cm}}{\pgfqpoint{1.112cm}{1.668cm}}
\pgfpathcurveto{\pgfqpoint{1.138cm}{1.642cm}}{\pgfqpoint{1.172cm}{1.628cm}}{\pgfqpoint{1.209cm}{1.628cm}}
\pgfpathcurveto{\pgfqpoint{1.245cm}{1.628cm}}{\pgfqpoint{1.28cm}{1.642cm}}{\pgfqpoint{1.305cm}{1.668cm}}
\pgfpathcurveto{\pgfqpoint{1.331cm}{1.694cm}}{\pgfqpoint{1.345cm}{1.728cm}}{\pgfqpoint{1.345cm}{1.765cm}}
\pgfusepath{fill}
\begin{pgfscope}
\pgfsetdash{}{0cm}
\pgfsetlinewidth{0.818mm}
\pgfsetroundcap
\pgfsetroundjoin
\pgfsetmiterlimit{7.0}
\pgfpathmoveto{\pgfqpoint{0.682cm}{1.065cm}}
\pgfpathlineto{\pgfqpoint{1.246cm}{0.315cm}}
\pgfpathlineto{\pgfqpoint{1.811cm}{1.065cm}}
\pgfusepath{stroke}
\end{pgfscope}
\pgfpathmoveto{\pgfqpoint{1.948cm}{1.065cm}}
\pgfpathcurveto{\pgfqpoint{1.948cm}{1.101cm}}{\pgfqpoint{1.933cm}{1.136cm}}{\pgfqpoint{1.907cm}{1.162cm}}
\pgfpathcurveto{\pgfqpoint{1.882cm}{1.187cm}}{\pgfqpoint{1.847cm}{1.202cm}}{\pgfqpoint{1.811cm}{1.202cm}}
\pgfpathcurveto{\pgfqpoint{1.775cm}{1.202cm}}{\pgfqpoint{1.74cm}{1.187cm}}{\pgfqpoint{1.714cm}{1.162cm}}
\pgfpathcurveto{\pgfqpoint{1.689cm}{1.136cm}}{\pgfqpoint{1.674cm}{1.101cm}}{\pgfqpoint{1.674cm}{1.065cm}}
\pgfpathcurveto{\pgfqpoint{1.674cm}{1.029cm}}{\pgfqpoint{1.689cm}{0.994cm}}{\pgfqpoint{1.714cm}{0.968cm}}
\pgfpathcurveto{\pgfqpoint{1.74cm}{0.942cm}}{\pgfqpoint{1.775cm}{0.928cm}}{\pgfqpoint{1.811cm}{0.928cm}}
\pgfpathcurveto{\pgfqpoint{1.847cm}{0.928cm}}{\pgfqpoint{1.882cm}{0.942cm}}{\pgfqpoint{1.907cm}{0.968cm}}
\pgfpathcurveto{\pgfqpoint{1.933cm}{0.994cm}}{\pgfqpoint{1.948cm}{1.029cm}}{\pgfqpoint{1.948cm}{1.065cm}}
\pgfusepath{fill}
\begin{pgfscope}
\pgfsetdash{}{0cm}
\pgfsetlinewidth{0.818mm}
\pgfsetmiterlimit{4.0}
\pgfpathmoveto{\pgfqpoint{1.383cm}{0.178cm}}
\pgfpathcurveto{\pgfqpoint{1.383cm}{0.214cm}}{\pgfqpoint{1.369cm}{0.249cm}}{\pgfqpoint{1.343cm}{0.275cm}}
\pgfpathcurveto{\pgfqpoint{1.317cm}{0.3cm}}{\pgfqpoint{1.283cm}{0.315cm}}{\pgfqpoint{1.246cm}{0.315cm}}
\pgfpathcurveto{\pgfqpoint{1.21cm}{0.315cm}}{\pgfqpoint{1.175cm}{0.3cm}}{\pgfqpoint{1.15cm}{0.275cm}}
\pgfpathcurveto{\pgfqpoint{1.124cm}{0.249cm}}{\pgfqpoint{1.11cm}{0.214cm}}{\pgfqpoint{1.11cm}{0.178cm}}
\pgfpathcurveto{\pgfqpoint{1.11cm}{0.141cm}}{\pgfqpoint{1.124cm}{0.107cm}}{\pgfqpoint{1.15cm}{0.081cm}}
\pgfpathcurveto{\pgfqpoint{1.175cm}{0.055cm}}{\pgfqpoint{1.21cm}{0.041cm}}{\pgfqpoint{1.246cm}{0.041cm}}
\pgfpathcurveto{\pgfqpoint{1.283cm}{0.041cm}}{\pgfqpoint{1.317cm}{0.055cm}}{\pgfqpoint{1.343cm}{0.081cm}}
\pgfpathcurveto{\pgfqpoint{1.369cm}{0.107cm}}{\pgfqpoint{1.383cm}{0.141cm}}{\pgfqpoint{1.383cm}{0.178cm}}
\pgfusepath{stroke}
\end{pgfscope}
\end{pgfscope}
\end{pgfscope}
\end{pgfscope}
\end{tikzpicture}}} \|_{\CC^{-\kappa}(\rho^\sigma)},
$$
$$
\|\UU_> X\|_{\CC^{-1+\alpha}(\rho^\alpha)}\lesssim 2^{-(\frac{1}{2}-\alpha-\kappa)\frac{4}{3}K} \|X\|_{\CC^{-\frac{1}{2}-\kappa}(\rho^\sigma)},
$$
and consequently
\begin{align*}
\|3\UU_>\llbracket X^2 \rrbracket\prec(\phi+\psi)\|_{\CC^{-1+\alpha}(\rho^{2+\alpha})}
&\lesssim \|\phi+\psi\|_{\CC^{\frac{1}{2}+\alpha}(\rho^{\frac{3}{2}+\alpha})}\|\UU_>\llbracket X^2 \rrbracket\|_{\CC^{-\frac{3}{2}}(\rho^{\frac{1}{2}})}\\
&\lesssim 2^{-(\frac{1}{2}-\kappa)K}  \|\phi+\psi\|_{\CC^{\frac{1}{2}+\alpha}(\rho^{\frac{3}{2}+\alpha})}\lesssim 1+\|\psi\|_{L^\infty(\rho)}^{\varepsilon}+ \|\psi\|_{\CC^{\frac{1}{2}+\alpha}(\rho^{\frac{3}{2}+\alpha})}.
\end{align*}
\begin{align*}
\|3( \phi + \psi)\prec\UU_>X^{\!\resizebox{!}{.8em}{
\begin{tikzpicture}
\pgfpathmoveto{\pgfqpoint{0cm}{-0.035cm}}
\pgfpathlineto{\pgfqpoint{1.976cm}{-0.035cm}}
\pgfpathlineto{\pgfqpoint{1.976cm}{1.94cm}}
\pgfpathlineto{\pgfqpoint{0cm}{1.94cm}}
\pgfpathclose
\pgfusepath{clip}
\begin{pgfscope}
\begin{pgfscope}
\pgfpathmoveto{\pgfqpoint{0cm}{-0.035cm}}
\pgfpathlineto{\pgfqpoint{1.976cm}{-0.035cm}}
\pgfpathlineto{\pgfqpoint{1.976cm}{1.94cm}}
\pgfpathlineto{\pgfqpoint{0cm}{1.94cm}}
\pgfpathclose
\pgfusepath{clip}
\begin{pgfscope}
\begin{pgfscope}
\pgfsetdash{}{0cm}
\pgfsetlinewidth{0.818mm}
\pgfsetroundcap
\pgfsetroundjoin
\pgfsetmiterlimit{7.0}
\definecolor{eps2pgf_color}{gray}{0}\pgfsetstrokecolor{eps2pgf_color}\pgfsetfillcolor{eps2pgf_color}
\pgfpathmoveto{\pgfqpoint{0.117cm}{1.815cm}}
\pgfpathlineto{\pgfqpoint{0.682cm}{1.065cm}}
\pgfpathlineto{\pgfqpoint{1.246cm}{1.815cm}}
\pgfusepath{stroke}
\end{pgfscope}
\definecolor{eps2pgf_color}{gray}{0}\pgfsetstrokecolor{eps2pgf_color}\pgfsetfillcolor{eps2pgf_color}
\pgfpathmoveto{\pgfqpoint{0.273cm}{1.789cm}}
\pgfpathcurveto{\pgfqpoint{0.273cm}{1.825cm}}{\pgfqpoint{0.259cm}{1.86cm}}{\pgfqpoint{0.233cm}{1.886cm}}
\pgfpathcurveto{\pgfqpoint{0.207cm}{1.912cm}}{\pgfqpoint{0.173cm}{1.926cm}}{\pgfqpoint{0.137cm}{1.926cm}}
\pgfpathcurveto{\pgfqpoint{0.1cm}{1.926cm}}{\pgfqpoint{0.066cm}{1.912cm}}{\pgfqpoint{0.04cm}{1.886cm}}
\pgfpathcurveto{\pgfqpoint{0.014cm}{1.86cm}}{\pgfqpoint{0cm}{1.825cm}}{\pgfqpoint{0cm}{1.789cm}}
\pgfpathcurveto{\pgfqpoint{0cm}{1.753cm}}{\pgfqpoint{0.014cm}{1.718cm}}{\pgfqpoint{0.04cm}{1.692cm}}
\pgfpathcurveto{\pgfqpoint{0.066cm}{1.667cm}}{\pgfqpoint{0.1cm}{1.652cm}}{\pgfqpoint{0.137cm}{1.652cm}}
\pgfpathcurveto{\pgfqpoint{0.173cm}{1.652cm}}{\pgfqpoint{0.207cm}{1.667cm}}{\pgfqpoint{0.233cm}{1.692cm}}
\pgfpathcurveto{\pgfqpoint{0.259cm}{1.718cm}}{\pgfqpoint{0.273cm}{1.753cm}}{\pgfqpoint{0.273cm}{1.789cm}}
\pgfusepath{fill}
\pgfpathmoveto{\pgfqpoint{1.345cm}{1.765cm}}
\pgfpathcurveto{\pgfqpoint{1.345cm}{1.801cm}}{\pgfqpoint{1.331cm}{1.836cm}}{\pgfqpoint{1.305cm}{1.862cm}}
\pgfpathcurveto{\pgfqpoint{1.28cm}{1.887cm}}{\pgfqpoint{1.245cm}{1.902cm}}{\pgfqpoint{1.209cm}{1.902cm}}
\pgfpathcurveto{\pgfqpoint{1.172cm}{1.902cm}}{\pgfqpoint{1.138cm}{1.887cm}}{\pgfqpoint{1.112cm}{1.862cm}}
\pgfpathcurveto{\pgfqpoint{1.087cm}{1.836cm}}{\pgfqpoint{1.072cm}{1.801cm}}{\pgfqpoint{1.072cm}{1.765cm}}
\pgfpathcurveto{\pgfqpoint{1.072cm}{1.728cm}}{\pgfqpoint{1.087cm}{1.694cm}}{\pgfqpoint{1.112cm}{1.668cm}}
\pgfpathcurveto{\pgfqpoint{1.138cm}{1.642cm}}{\pgfqpoint{1.172cm}{1.628cm}}{\pgfqpoint{1.209cm}{1.628cm}}
\pgfpathcurveto{\pgfqpoint{1.245cm}{1.628cm}}{\pgfqpoint{1.28cm}{1.642cm}}{\pgfqpoint{1.305cm}{1.668cm}}
\pgfpathcurveto{\pgfqpoint{1.331cm}{1.694cm}}{\pgfqpoint{1.345cm}{1.728cm}}{\pgfqpoint{1.345cm}{1.765cm}}
\pgfusepath{fill}
\begin{pgfscope}
\pgfsetdash{}{0cm}
\pgfsetlinewidth{0.818mm}
\pgfsetroundcap
\pgfsetroundjoin
\pgfsetmiterlimit{7.0}
\pgfpathmoveto{\pgfqpoint{0.682cm}{1.065cm}}
\pgfpathlineto{\pgfqpoint{1.246cm}{0.315cm}}
\pgfpathlineto{\pgfqpoint{1.811cm}{1.065cm}}
\pgfusepath{stroke}
\end{pgfscope}
\pgfpathmoveto{\pgfqpoint{1.948cm}{1.065cm}}
\pgfpathcurveto{\pgfqpoint{1.948cm}{1.101cm}}{\pgfqpoint{1.933cm}{1.136cm}}{\pgfqpoint{1.907cm}{1.162cm}}
\pgfpathcurveto{\pgfqpoint{1.882cm}{1.187cm}}{\pgfqpoint{1.847cm}{1.202cm}}{\pgfqpoint{1.811cm}{1.202cm}}
\pgfpathcurveto{\pgfqpoint{1.775cm}{1.202cm}}{\pgfqpoint{1.74cm}{1.187cm}}{\pgfqpoint{1.714cm}{1.162cm}}
\pgfpathcurveto{\pgfqpoint{1.689cm}{1.136cm}}{\pgfqpoint{1.674cm}{1.101cm}}{\pgfqpoint{1.674cm}{1.065cm}}
\pgfpathcurveto{\pgfqpoint{1.674cm}{1.029cm}}{\pgfqpoint{1.689cm}{0.994cm}}{\pgfqpoint{1.714cm}{0.968cm}}
\pgfpathcurveto{\pgfqpoint{1.74cm}{0.942cm}}{\pgfqpoint{1.775cm}{0.928cm}}{\pgfqpoint{1.811cm}{0.928cm}}
\pgfpathcurveto{\pgfqpoint{1.847cm}{0.928cm}}{\pgfqpoint{1.882cm}{0.942cm}}{\pgfqpoint{1.907cm}{0.968cm}}
\pgfpathcurveto{\pgfqpoint{1.933cm}{0.994cm}}{\pgfqpoint{1.948cm}{1.029cm}}{\pgfqpoint{1.948cm}{1.065cm}}
\pgfusepath{fill}
\begin{pgfscope}
\pgfsetdash{}{0cm}
\pgfsetlinewidth{0.818mm}
\pgfsetmiterlimit{7.0}
\pgfpathmoveto{\pgfqpoint{1.246cm}{0.315cm}}
\pgfpathlineto{\pgfqpoint{1.244cm}{1.061cm}}
\pgfusepath{stroke}
\end{pgfscope}
\pgfpathmoveto{\pgfqpoint{1.38cm}{1.065cm}}
\pgfpathcurveto{\pgfqpoint{1.38cm}{1.101cm}}{\pgfqpoint{1.366cm}{1.136cm}}{\pgfqpoint{1.34cm}{1.162cm}}
\pgfpathcurveto{\pgfqpoint{1.315cm}{1.187cm}}{\pgfqpoint{1.28cm}{1.202cm}}{\pgfqpoint{1.244cm}{1.202cm}}
\pgfpathcurveto{\pgfqpoint{1.207cm}{1.202cm}}{\pgfqpoint{1.173cm}{1.187cm}}{\pgfqpoint{1.147cm}{1.162cm}}
\pgfpathcurveto{\pgfqpoint{1.121cm}{1.136cm}}{\pgfqpoint{1.107cm}{1.101cm}}{\pgfqpoint{1.107cm}{1.065cm}}
\pgfpathcurveto{\pgfqpoint{1.107cm}{1.029cm}}{\pgfqpoint{1.121cm}{0.994cm}}{\pgfqpoint{1.147cm}{0.968cm}}
\pgfpathcurveto{\pgfqpoint{1.173cm}{0.942cm}}{\pgfqpoint{1.207cm}{0.928cm}}{\pgfqpoint{1.244cm}{0.928cm}}
\pgfpathcurveto{\pgfqpoint{1.28cm}{0.928cm}}{\pgfqpoint{1.315cm}{0.942cm}}{\pgfqpoint{1.34cm}{0.968cm}}
\pgfpathcurveto{\pgfqpoint{1.366cm}{0.994cm}}{\pgfqpoint{1.38cm}{1.029cm}}{\pgfqpoint{1.38cm}{1.065cm}}
\pgfusepath{fill}
\begin{pgfscope}
\pgfsetdash{}{0cm}
\pgfsetlinewidth{0.818mm}
\pgfsetmiterlimit{4.0}
\pgfpathmoveto{\pgfqpoint{1.383cm}{0.178cm}}
\pgfpathcurveto{\pgfqpoint{1.383cm}{0.214cm}}{\pgfqpoint{1.369cm}{0.249cm}}{\pgfqpoint{1.343cm}{0.275cm}}
\pgfpathcurveto{\pgfqpoint{1.317cm}{0.3cm}}{\pgfqpoint{1.283cm}{0.315cm}}{\pgfqpoint{1.246cm}{0.315cm}}
\pgfpathcurveto{\pgfqpoint{1.21cm}{0.315cm}}{\pgfqpoint{1.175cm}{0.3cm}}{\pgfqpoint{1.15cm}{0.275cm}}
\pgfpathcurveto{\pgfqpoint{1.124cm}{0.249cm}}{\pgfqpoint{1.11cm}{0.214cm}}{\pgfqpoint{1.11cm}{0.178cm}}
\pgfpathcurveto{\pgfqpoint{1.11cm}{0.141cm}}{\pgfqpoint{1.124cm}{0.107cm}}{\pgfqpoint{1.15cm}{0.081cm}}
\pgfpathcurveto{\pgfqpoint{1.175cm}{0.055cm}}{\pgfqpoint{1.21cm}{0.041cm}}{\pgfqpoint{1.246cm}{0.041cm}}
\pgfpathcurveto{\pgfqpoint{1.283cm}{0.041cm}}{\pgfqpoint{1.317cm}{0.055cm}}{\pgfqpoint{1.343cm}{0.081cm}}
\pgfpathcurveto{\pgfqpoint{1.369cm}{0.107cm}}{\pgfqpoint{1.383cm}{0.141cm}}{\pgfqpoint{1.383cm}{0.178cm}}
\pgfusepath{stroke}
\end{pgfscope}
\end{pgfscope}
\end{pgfscope}
\end{pgfscope}
\end{tikzpicture}}}\|_{\CC^{-1+\alpha}(\rho)}
&\lesssim  \|\phi+\psi\|_{L^\infty(\rho)}\|\UU_>X^{\!\resizebox{!}{.8em}{
\begin{tikzpicture}
\pgfpathmoveto{\pgfqpoint{0cm}{-0.035cm}}
\pgfpathlineto{\pgfqpoint{1.976cm}{-0.035cm}}
\pgfpathlineto{\pgfqpoint{1.976cm}{1.94cm}}
\pgfpathlineto{\pgfqpoint{0cm}{1.94cm}}
\pgfpathclose
\pgfusepath{clip}
\begin{pgfscope}
\begin{pgfscope}
\pgfpathmoveto{\pgfqpoint{0cm}{-0.035cm}}
\pgfpathlineto{\pgfqpoint{1.976cm}{-0.035cm}}
\pgfpathlineto{\pgfqpoint{1.976cm}{1.94cm}}
\pgfpathlineto{\pgfqpoint{0cm}{1.94cm}}
\pgfpathclose
\pgfusepath{clip}
\begin{pgfscope}
\begin{pgfscope}
\pgfsetdash{}{0cm}
\pgfsetlinewidth{0.818mm}
\pgfsetroundcap
\pgfsetroundjoin
\pgfsetmiterlimit{7.0}
\definecolor{eps2pgf_color}{gray}{0}\pgfsetstrokecolor{eps2pgf_color}\pgfsetfillcolor{eps2pgf_color}
\pgfpathmoveto{\pgfqpoint{0.117cm}{1.815cm}}
\pgfpathlineto{\pgfqpoint{0.682cm}{1.065cm}}
\pgfpathlineto{\pgfqpoint{1.246cm}{1.815cm}}
\pgfusepath{stroke}
\end{pgfscope}
\definecolor{eps2pgf_color}{gray}{0}\pgfsetstrokecolor{eps2pgf_color}\pgfsetfillcolor{eps2pgf_color}
\pgfpathmoveto{\pgfqpoint{0.273cm}{1.789cm}}
\pgfpathcurveto{\pgfqpoint{0.273cm}{1.825cm}}{\pgfqpoint{0.259cm}{1.86cm}}{\pgfqpoint{0.233cm}{1.886cm}}
\pgfpathcurveto{\pgfqpoint{0.207cm}{1.912cm}}{\pgfqpoint{0.173cm}{1.926cm}}{\pgfqpoint{0.137cm}{1.926cm}}
\pgfpathcurveto{\pgfqpoint{0.1cm}{1.926cm}}{\pgfqpoint{0.066cm}{1.912cm}}{\pgfqpoint{0.04cm}{1.886cm}}
\pgfpathcurveto{\pgfqpoint{0.014cm}{1.86cm}}{\pgfqpoint{0cm}{1.825cm}}{\pgfqpoint{0cm}{1.789cm}}
\pgfpathcurveto{\pgfqpoint{0cm}{1.753cm}}{\pgfqpoint{0.014cm}{1.718cm}}{\pgfqpoint{0.04cm}{1.692cm}}
\pgfpathcurveto{\pgfqpoint{0.066cm}{1.667cm}}{\pgfqpoint{0.1cm}{1.652cm}}{\pgfqpoint{0.137cm}{1.652cm}}
\pgfpathcurveto{\pgfqpoint{0.173cm}{1.652cm}}{\pgfqpoint{0.207cm}{1.667cm}}{\pgfqpoint{0.233cm}{1.692cm}}
\pgfpathcurveto{\pgfqpoint{0.259cm}{1.718cm}}{\pgfqpoint{0.273cm}{1.753cm}}{\pgfqpoint{0.273cm}{1.789cm}}
\pgfusepath{fill}
\pgfpathmoveto{\pgfqpoint{1.345cm}{1.765cm}}
\pgfpathcurveto{\pgfqpoint{1.345cm}{1.801cm}}{\pgfqpoint{1.331cm}{1.836cm}}{\pgfqpoint{1.305cm}{1.862cm}}
\pgfpathcurveto{\pgfqpoint{1.28cm}{1.887cm}}{\pgfqpoint{1.245cm}{1.902cm}}{\pgfqpoint{1.209cm}{1.902cm}}
\pgfpathcurveto{\pgfqpoint{1.172cm}{1.902cm}}{\pgfqpoint{1.138cm}{1.887cm}}{\pgfqpoint{1.112cm}{1.862cm}}
\pgfpathcurveto{\pgfqpoint{1.087cm}{1.836cm}}{\pgfqpoint{1.072cm}{1.801cm}}{\pgfqpoint{1.072cm}{1.765cm}}
\pgfpathcurveto{\pgfqpoint{1.072cm}{1.728cm}}{\pgfqpoint{1.087cm}{1.694cm}}{\pgfqpoint{1.112cm}{1.668cm}}
\pgfpathcurveto{\pgfqpoint{1.138cm}{1.642cm}}{\pgfqpoint{1.172cm}{1.628cm}}{\pgfqpoint{1.209cm}{1.628cm}}
\pgfpathcurveto{\pgfqpoint{1.245cm}{1.628cm}}{\pgfqpoint{1.28cm}{1.642cm}}{\pgfqpoint{1.305cm}{1.668cm}}
\pgfpathcurveto{\pgfqpoint{1.331cm}{1.694cm}}{\pgfqpoint{1.345cm}{1.728cm}}{\pgfqpoint{1.345cm}{1.765cm}}
\pgfusepath{fill}
\begin{pgfscope}
\pgfsetdash{}{0cm}
\pgfsetlinewidth{0.818mm}
\pgfsetroundcap
\pgfsetroundjoin
\pgfsetmiterlimit{7.0}
\pgfpathmoveto{\pgfqpoint{0.682cm}{1.065cm}}
\pgfpathlineto{\pgfqpoint{1.246cm}{0.315cm}}
\pgfpathlineto{\pgfqpoint{1.811cm}{1.065cm}}
\pgfusepath{stroke}
\end{pgfscope}
\pgfpathmoveto{\pgfqpoint{1.948cm}{1.065cm}}
\pgfpathcurveto{\pgfqpoint{1.948cm}{1.101cm}}{\pgfqpoint{1.933cm}{1.136cm}}{\pgfqpoint{1.907cm}{1.162cm}}
\pgfpathcurveto{\pgfqpoint{1.882cm}{1.187cm}}{\pgfqpoint{1.847cm}{1.202cm}}{\pgfqpoint{1.811cm}{1.202cm}}
\pgfpathcurveto{\pgfqpoint{1.775cm}{1.202cm}}{\pgfqpoint{1.74cm}{1.187cm}}{\pgfqpoint{1.714cm}{1.162cm}}
\pgfpathcurveto{\pgfqpoint{1.689cm}{1.136cm}}{\pgfqpoint{1.674cm}{1.101cm}}{\pgfqpoint{1.674cm}{1.065cm}}
\pgfpathcurveto{\pgfqpoint{1.674cm}{1.029cm}}{\pgfqpoint{1.689cm}{0.994cm}}{\pgfqpoint{1.714cm}{0.968cm}}
\pgfpathcurveto{\pgfqpoint{1.74cm}{0.942cm}}{\pgfqpoint{1.775cm}{0.928cm}}{\pgfqpoint{1.811cm}{0.928cm}}
\pgfpathcurveto{\pgfqpoint{1.847cm}{0.928cm}}{\pgfqpoint{1.882cm}{0.942cm}}{\pgfqpoint{1.907cm}{0.968cm}}
\pgfpathcurveto{\pgfqpoint{1.933cm}{0.994cm}}{\pgfqpoint{1.948cm}{1.029cm}}{\pgfqpoint{1.948cm}{1.065cm}}
\pgfusepath{fill}
\begin{pgfscope}
\pgfsetdash{}{0cm}
\pgfsetlinewidth{0.818mm}
\pgfsetmiterlimit{7.0}
\pgfpathmoveto{\pgfqpoint{1.246cm}{0.315cm}}
\pgfpathlineto{\pgfqpoint{1.244cm}{1.061cm}}
\pgfusepath{stroke}
\end{pgfscope}
\pgfpathmoveto{\pgfqpoint{1.38cm}{1.065cm}}
\pgfpathcurveto{\pgfqpoint{1.38cm}{1.101cm}}{\pgfqpoint{1.366cm}{1.136cm}}{\pgfqpoint{1.34cm}{1.162cm}}
\pgfpathcurveto{\pgfqpoint{1.315cm}{1.187cm}}{\pgfqpoint{1.28cm}{1.202cm}}{\pgfqpoint{1.244cm}{1.202cm}}
\pgfpathcurveto{\pgfqpoint{1.207cm}{1.202cm}}{\pgfqpoint{1.173cm}{1.187cm}}{\pgfqpoint{1.147cm}{1.162cm}}
\pgfpathcurveto{\pgfqpoint{1.121cm}{1.136cm}}{\pgfqpoint{1.107cm}{1.101cm}}{\pgfqpoint{1.107cm}{1.065cm}}
\pgfpathcurveto{\pgfqpoint{1.107cm}{1.029cm}}{\pgfqpoint{1.121cm}{0.994cm}}{\pgfqpoint{1.147cm}{0.968cm}}
\pgfpathcurveto{\pgfqpoint{1.173cm}{0.942cm}}{\pgfqpoint{1.207cm}{0.928cm}}{\pgfqpoint{1.244cm}{0.928cm}}
\pgfpathcurveto{\pgfqpoint{1.28cm}{0.928cm}}{\pgfqpoint{1.315cm}{0.942cm}}{\pgfqpoint{1.34cm}{0.968cm}}
\pgfpathcurveto{\pgfqpoint{1.366cm}{0.994cm}}{\pgfqpoint{1.38cm}{1.029cm}}{\pgfqpoint{1.38cm}{1.065cm}}
\pgfusepath{fill}
\begin{pgfscope}
\pgfsetdash{}{0cm}
\pgfsetlinewidth{0.818mm}
\pgfsetmiterlimit{4.0}
\pgfpathmoveto{\pgfqpoint{1.383cm}{0.178cm}}
\pgfpathcurveto{\pgfqpoint{1.383cm}{0.214cm}}{\pgfqpoint{1.369cm}{0.249cm}}{\pgfqpoint{1.343cm}{0.275cm}}
\pgfpathcurveto{\pgfqpoint{1.317cm}{0.3cm}}{\pgfqpoint{1.283cm}{0.315cm}}{\pgfqpoint{1.246cm}{0.315cm}}
\pgfpathcurveto{\pgfqpoint{1.21cm}{0.315cm}}{\pgfqpoint{1.175cm}{0.3cm}}{\pgfqpoint{1.15cm}{0.275cm}}
\pgfpathcurveto{\pgfqpoint{1.124cm}{0.249cm}}{\pgfqpoint{1.11cm}{0.214cm}}{\pgfqpoint{1.11cm}{0.178cm}}
\pgfpathcurveto{\pgfqpoint{1.11cm}{0.141cm}}{\pgfqpoint{1.124cm}{0.107cm}}{\pgfqpoint{1.15cm}{0.081cm}}
\pgfpathcurveto{\pgfqpoint{1.175cm}{0.055cm}}{\pgfqpoint{1.21cm}{0.041cm}}{\pgfqpoint{1.246cm}{0.041cm}}
\pgfpathcurveto{\pgfqpoint{1.283cm}{0.041cm}}{\pgfqpoint{1.317cm}{0.055cm}}{\pgfqpoint{1.343cm}{0.081cm}}
\pgfpathcurveto{\pgfqpoint{1.369cm}{0.107cm}}{\pgfqpoint{1.383cm}{0.141cm}}{\pgfqpoint{1.383cm}{0.178cm}}
\pgfusepath{stroke}
\end{pgfscope}
\end{pgfscope}
\end{pgfscope}
\end{pgfscope}
\end{tikzpicture}}}\|_{\CC^{-1+\alpha}}\lesssim 2^{-(1-\alpha-\kappa)K/2}\|\phi+\psi\|_{L^\infty(\rho)}\\
&\lesssim 1+\|\psi\|_{L^\infty(\rho)}^{\varepsilon},
\end{align*}
\begin{align*}
&\|6\UU_>X\succ(X^{\!\resizebox{0.6em}{!}{
\begin{tikzpicture}
\pgfpathmoveto{\pgfqpoint{0cm}{-0.035cm}}
\pgfpathlineto{\pgfqpoint{1.376cm}{-0.035cm}}
\pgfpathlineto{\pgfqpoint{1.376cm}{1.552cm}}
\pgfpathlineto{\pgfqpoint{0cm}{1.552cm}}
\pgfpathclose
\pgfusepath{clip}
\begin{pgfscope}
\begin{pgfscope}
\pgfpathmoveto{\pgfqpoint{0cm}{-0.035cm}}
\pgfpathlineto{\pgfqpoint{1.376cm}{-0.035cm}}
\pgfpathlineto{\pgfqpoint{1.376cm}{1.552cm}}
\pgfpathlineto{\pgfqpoint{0cm}{1.552cm}}
\pgfpathclose
\pgfusepath{clip}
\begin{pgfscope}
\begin{pgfscope}
\pgfsetdash{}{0cm}
\pgfsetlinewidth{0.818mm}
\pgfsetroundcap
\pgfsetroundjoin
\pgfsetmiterlimit{7.0}
\definecolor{eps2pgf_color}{gray}{0}\pgfsetstrokecolor{eps2pgf_color}\pgfsetfillcolor{eps2pgf_color}
\pgfpathmoveto{\pgfqpoint{0.117cm}{1.421cm}}
\pgfpathlineto{\pgfqpoint{0.682cm}{0.671cm}}
\pgfpathlineto{\pgfqpoint{1.246cm}{1.421cm}}
\pgfusepath{stroke}
\end{pgfscope}
\definecolor{eps2pgf_color}{gray}{0}\pgfsetstrokecolor{eps2pgf_color}\pgfsetfillcolor{eps2pgf_color}
\pgfpathmoveto{\pgfqpoint{0.273cm}{1.395cm}}
\pgfpathcurveto{\pgfqpoint{0.273cm}{1.432cm}}{\pgfqpoint{0.259cm}{1.467cm}}{\pgfqpoint{0.233cm}{1.492cm}}
\pgfpathcurveto{\pgfqpoint{0.207cm}{1.518cm}}{\pgfqpoint{0.173cm}{1.532cm}}{\pgfqpoint{0.137cm}{1.532cm}}
\pgfpathcurveto{\pgfqpoint{0.1cm}{1.532cm}}{\pgfqpoint{0.066cm}{1.518cm}}{\pgfqpoint{0.04cm}{1.492cm}}
\pgfpathcurveto{\pgfqpoint{0.014cm}{1.467cm}}{\pgfqpoint{0cm}{1.432cm}}{\pgfqpoint{0cm}{1.395cm}}
\pgfpathcurveto{\pgfqpoint{0cm}{1.359cm}}{\pgfqpoint{0.014cm}{1.324cm}}{\pgfqpoint{0.04cm}{1.299cm}}
\pgfpathcurveto{\pgfqpoint{0.066cm}{1.273cm}}{\pgfqpoint{0.1cm}{1.258cm}}{\pgfqpoint{0.137cm}{1.258cm}}
\pgfpathcurveto{\pgfqpoint{0.173cm}{1.258cm}}{\pgfqpoint{0.207cm}{1.273cm}}{\pgfqpoint{0.233cm}{1.299cm}}
\pgfpathcurveto{\pgfqpoint{0.259cm}{1.324cm}}{\pgfqpoint{0.273cm}{1.359cm}}{\pgfqpoint{0.273cm}{1.395cm}}
\pgfusepath{fill}
\begin{pgfscope}
\pgfsetdash{}{0cm}
\pgfsetlinewidth{0.818mm}
\pgfsetmiterlimit{7.0}
\pgfpathmoveto{\pgfqpoint{0.682cm}{0.671cm}}
\pgfpathlineto{\pgfqpoint{0.679cm}{1.418cm}}
\pgfusepath{stroke}
\end{pgfscope}
\pgfpathmoveto{\pgfqpoint{0.815cm}{1.399cm}}
\pgfpathcurveto{\pgfqpoint{0.815cm}{1.435cm}}{\pgfqpoint{0.801cm}{1.47cm}}{\pgfqpoint{0.775cm}{1.496cm}}
\pgfpathcurveto{\pgfqpoint{0.75cm}{1.521cm}}{\pgfqpoint{0.715cm}{1.536cm}}{\pgfqpoint{0.679cm}{1.536cm}}
\pgfpathcurveto{\pgfqpoint{0.643cm}{1.536cm}}{\pgfqpoint{0.608cm}{1.521cm}}{\pgfqpoint{0.582cm}{1.496cm}}
\pgfpathcurveto{\pgfqpoint{0.557cm}{1.47cm}}{\pgfqpoint{0.542cm}{1.435cm}}{\pgfqpoint{0.542cm}{1.399cm}}
\pgfpathcurveto{\pgfqpoint{0.542cm}{1.363cm}}{\pgfqpoint{0.557cm}{1.328cm}}{\pgfqpoint{0.582cm}{1.302cm}}
\pgfpathcurveto{\pgfqpoint{0.608cm}{1.276cm}}{\pgfqpoint{0.643cm}{1.262cm}}{\pgfqpoint{0.679cm}{1.262cm}}
\pgfpathcurveto{\pgfqpoint{0.715cm}{1.262cm}}{\pgfqpoint{0.75cm}{1.276cm}}{\pgfqpoint{0.775cm}{1.302cm}}
\pgfpathcurveto{\pgfqpoint{0.801cm}{1.328cm}}{\pgfqpoint{0.815cm}{1.363cm}}{\pgfqpoint{0.815cm}{1.399cm}}
\pgfusepath{fill}
\pgfpathmoveto{\pgfqpoint{1.345cm}{1.371cm}}
\pgfpathcurveto{\pgfqpoint{1.345cm}{1.408cm}}{\pgfqpoint{1.331cm}{1.442cm}}{\pgfqpoint{1.305cm}{1.468cm}}
\pgfpathcurveto{\pgfqpoint{1.28cm}{1.494cm}}{\pgfqpoint{1.245cm}{1.508cm}}{\pgfqpoint{1.209cm}{1.508cm}}
\pgfpathcurveto{\pgfqpoint{1.172cm}{1.508cm}}{\pgfqpoint{1.138cm}{1.494cm}}{\pgfqpoint{1.112cm}{1.468cm}}
\pgfpathcurveto{\pgfqpoint{1.087cm}{1.442cm}}{\pgfqpoint{1.072cm}{1.408cm}}{\pgfqpoint{1.072cm}{1.371cm}}
\pgfpathcurveto{\pgfqpoint{1.072cm}{1.335cm}}{\pgfqpoint{1.087cm}{1.3cm}}{\pgfqpoint{1.112cm}{1.274cm}}
\pgfpathcurveto{\pgfqpoint{1.138cm}{1.249cm}}{\pgfqpoint{1.172cm}{1.234cm}}{\pgfqpoint{1.209cm}{1.234cm}}
\pgfpathcurveto{\pgfqpoint{1.245cm}{1.234cm}}{\pgfqpoint{1.28cm}{1.249cm}}{\pgfqpoint{1.305cm}{1.274cm}}
\pgfpathcurveto{\pgfqpoint{1.331cm}{1.3cm}}{\pgfqpoint{1.345cm}{1.335cm}}{\pgfqpoint{1.345cm}{1.371cm}}
\pgfusepath{fill}
\begin{pgfscope}
\pgfsetdash{}{0cm}
\pgfsetlinewidth{0.818mm}
\pgfsetroundcap
\pgfsetmiterlimit{4.0}
\pgfpathmoveto{\pgfqpoint{0.682cm}{0.671cm}}
\pgfpathlineto{\pgfqpoint{0.682cm}{0.042cm}}
\pgfusepath{stroke}
\end{pgfscope}
\end{pgfscope}
\end{pgfscope}
\end{pgfscope}
\end{tikzpicture}}}(\phi+\psi))\|_{\CC^{-1+\alpha}(\rho)}+\|6\UU_> X\prec(X^{\!\resizebox{0.6em}{!}{
\begin{tikzpicture}
\pgfpathmoveto{\pgfqpoint{0cm}{-0.035cm}}
\pgfpathlineto{\pgfqpoint{1.376cm}{-0.035cm}}
\pgfpathlineto{\pgfqpoint{1.376cm}{1.552cm}}
\pgfpathlineto{\pgfqpoint{0cm}{1.552cm}}
\pgfpathclose
\pgfusepath{clip}
\begin{pgfscope}
\begin{pgfscope}
\pgfpathmoveto{\pgfqpoint{0cm}{-0.035cm}}
\pgfpathlineto{\pgfqpoint{1.376cm}{-0.035cm}}
\pgfpathlineto{\pgfqpoint{1.376cm}{1.552cm}}
\pgfpathlineto{\pgfqpoint{0cm}{1.552cm}}
\pgfpathclose
\pgfusepath{clip}
\begin{pgfscope}
\begin{pgfscope}
\pgfsetdash{}{0cm}
\pgfsetlinewidth{0.818mm}
\pgfsetroundcap
\pgfsetroundjoin
\pgfsetmiterlimit{7.0}
\definecolor{eps2pgf_color}{gray}{0}\pgfsetstrokecolor{eps2pgf_color}\pgfsetfillcolor{eps2pgf_color}
\pgfpathmoveto{\pgfqpoint{0.117cm}{1.421cm}}
\pgfpathlineto{\pgfqpoint{0.682cm}{0.671cm}}
\pgfpathlineto{\pgfqpoint{1.246cm}{1.421cm}}
\pgfusepath{stroke}
\end{pgfscope}
\definecolor{eps2pgf_color}{gray}{0}\pgfsetstrokecolor{eps2pgf_color}\pgfsetfillcolor{eps2pgf_color}
\pgfpathmoveto{\pgfqpoint{0.273cm}{1.395cm}}
\pgfpathcurveto{\pgfqpoint{0.273cm}{1.432cm}}{\pgfqpoint{0.259cm}{1.467cm}}{\pgfqpoint{0.233cm}{1.492cm}}
\pgfpathcurveto{\pgfqpoint{0.207cm}{1.518cm}}{\pgfqpoint{0.173cm}{1.532cm}}{\pgfqpoint{0.137cm}{1.532cm}}
\pgfpathcurveto{\pgfqpoint{0.1cm}{1.532cm}}{\pgfqpoint{0.066cm}{1.518cm}}{\pgfqpoint{0.04cm}{1.492cm}}
\pgfpathcurveto{\pgfqpoint{0.014cm}{1.467cm}}{\pgfqpoint{0cm}{1.432cm}}{\pgfqpoint{0cm}{1.395cm}}
\pgfpathcurveto{\pgfqpoint{0cm}{1.359cm}}{\pgfqpoint{0.014cm}{1.324cm}}{\pgfqpoint{0.04cm}{1.299cm}}
\pgfpathcurveto{\pgfqpoint{0.066cm}{1.273cm}}{\pgfqpoint{0.1cm}{1.258cm}}{\pgfqpoint{0.137cm}{1.258cm}}
\pgfpathcurveto{\pgfqpoint{0.173cm}{1.258cm}}{\pgfqpoint{0.207cm}{1.273cm}}{\pgfqpoint{0.233cm}{1.299cm}}
\pgfpathcurveto{\pgfqpoint{0.259cm}{1.324cm}}{\pgfqpoint{0.273cm}{1.359cm}}{\pgfqpoint{0.273cm}{1.395cm}}
\pgfusepath{fill}
\begin{pgfscope}
\pgfsetdash{}{0cm}
\pgfsetlinewidth{0.818mm}
\pgfsetmiterlimit{7.0}
\pgfpathmoveto{\pgfqpoint{0.682cm}{0.671cm}}
\pgfpathlineto{\pgfqpoint{0.679cm}{1.418cm}}
\pgfusepath{stroke}
\end{pgfscope}
\pgfpathmoveto{\pgfqpoint{0.815cm}{1.399cm}}
\pgfpathcurveto{\pgfqpoint{0.815cm}{1.435cm}}{\pgfqpoint{0.801cm}{1.47cm}}{\pgfqpoint{0.775cm}{1.496cm}}
\pgfpathcurveto{\pgfqpoint{0.75cm}{1.521cm}}{\pgfqpoint{0.715cm}{1.536cm}}{\pgfqpoint{0.679cm}{1.536cm}}
\pgfpathcurveto{\pgfqpoint{0.643cm}{1.536cm}}{\pgfqpoint{0.608cm}{1.521cm}}{\pgfqpoint{0.582cm}{1.496cm}}
\pgfpathcurveto{\pgfqpoint{0.557cm}{1.47cm}}{\pgfqpoint{0.542cm}{1.435cm}}{\pgfqpoint{0.542cm}{1.399cm}}
\pgfpathcurveto{\pgfqpoint{0.542cm}{1.363cm}}{\pgfqpoint{0.557cm}{1.328cm}}{\pgfqpoint{0.582cm}{1.302cm}}
\pgfpathcurveto{\pgfqpoint{0.608cm}{1.276cm}}{\pgfqpoint{0.643cm}{1.262cm}}{\pgfqpoint{0.679cm}{1.262cm}}
\pgfpathcurveto{\pgfqpoint{0.715cm}{1.262cm}}{\pgfqpoint{0.75cm}{1.276cm}}{\pgfqpoint{0.775cm}{1.302cm}}
\pgfpathcurveto{\pgfqpoint{0.801cm}{1.328cm}}{\pgfqpoint{0.815cm}{1.363cm}}{\pgfqpoint{0.815cm}{1.399cm}}
\pgfusepath{fill}
\pgfpathmoveto{\pgfqpoint{1.345cm}{1.371cm}}
\pgfpathcurveto{\pgfqpoint{1.345cm}{1.408cm}}{\pgfqpoint{1.331cm}{1.442cm}}{\pgfqpoint{1.305cm}{1.468cm}}
\pgfpathcurveto{\pgfqpoint{1.28cm}{1.494cm}}{\pgfqpoint{1.245cm}{1.508cm}}{\pgfqpoint{1.209cm}{1.508cm}}
\pgfpathcurveto{\pgfqpoint{1.172cm}{1.508cm}}{\pgfqpoint{1.138cm}{1.494cm}}{\pgfqpoint{1.112cm}{1.468cm}}
\pgfpathcurveto{\pgfqpoint{1.087cm}{1.442cm}}{\pgfqpoint{1.072cm}{1.408cm}}{\pgfqpoint{1.072cm}{1.371cm}}
\pgfpathcurveto{\pgfqpoint{1.072cm}{1.335cm}}{\pgfqpoint{1.087cm}{1.3cm}}{\pgfqpoint{1.112cm}{1.274cm}}
\pgfpathcurveto{\pgfqpoint{1.138cm}{1.249cm}}{\pgfqpoint{1.172cm}{1.234cm}}{\pgfqpoint{1.209cm}{1.234cm}}
\pgfpathcurveto{\pgfqpoint{1.245cm}{1.234cm}}{\pgfqpoint{1.28cm}{1.249cm}}{\pgfqpoint{1.305cm}{1.274cm}}
\pgfpathcurveto{\pgfqpoint{1.331cm}{1.3cm}}{\pgfqpoint{1.345cm}{1.335cm}}{\pgfqpoint{1.345cm}{1.371cm}}
\pgfusepath{fill}
\begin{pgfscope}
\pgfsetdash{}{0cm}
\pgfsetlinewidth{0.818mm}
\pgfsetroundcap
\pgfsetmiterlimit{4.0}
\pgfpathmoveto{\pgfqpoint{0.682cm}{0.671cm}}
\pgfpathlineto{\pgfqpoint{0.682cm}{0.042cm}}
\pgfusepath{stroke}
\end{pgfscope}
\end{pgfscope}
\end{pgfscope}
\end{pgfscope}
\end{tikzpicture}}}(\phi+\psi))\|_{\CC^{-1+\alpha}(\rho)}\\
&\quad\lesssim \|\phi+\psi\|_{L^\infty(\rho)}\|\UU_>X\|_{\CC^{-1+\alpha}(\rho^{-\alpha})}\\
&\quad\lesssim 2^{-(\frac{1}{2}-\alpha-\kappa)\frac{2}{3}K}\|\phi+\psi\|_{L^\infty(\rho)}\lesssim 1+\|\psi\|_{L^\infty(\rho)}^{\varepsilon},
\end{align*}
\begin{align*}
\|6(\phi+\psi)\prec\UU_{>}X^{\!\resizebox{!}{.8em}{
\begin{tikzpicture}
\pgfpathmoveto{\pgfqpoint{0cm}{-0.035cm}}
\pgfpathlineto{\pgfqpoint{1.976cm}{-0.035cm}}
\pgfpathlineto{\pgfqpoint{1.976cm}{1.94cm}}
\pgfpathlineto{\pgfqpoint{0cm}{1.94cm}}
\pgfpathclose
\pgfusepath{clip}
\begin{pgfscope}
\begin{pgfscope}
\pgfpathmoveto{\pgfqpoint{0cm}{-0.035cm}}
\pgfpathlineto{\pgfqpoint{1.976cm}{-0.035cm}}
\pgfpathlineto{\pgfqpoint{1.976cm}{1.94cm}}
\pgfpathlineto{\pgfqpoint{0cm}{1.94cm}}
\pgfpathclose
\pgfusepath{clip}
\begin{pgfscope}
\begin{pgfscope}
\pgfsetdash{}{0cm}
\pgfsetlinewidth{0.818mm}
\pgfsetroundcap
\pgfsetroundjoin
\pgfsetmiterlimit{7.0}
\definecolor{eps2pgf_color}{gray}{0}\pgfsetstrokecolor{eps2pgf_color}\pgfsetfillcolor{eps2pgf_color}
\pgfpathmoveto{\pgfqpoint{0.117cm}{1.815cm}}
\pgfpathlineto{\pgfqpoint{0.682cm}{1.065cm}}
\pgfpathlineto{\pgfqpoint{1.246cm}{1.815cm}}
\pgfusepath{stroke}
\end{pgfscope}
\definecolor{eps2pgf_color}{gray}{0}\pgfsetstrokecolor{eps2pgf_color}\pgfsetfillcolor{eps2pgf_color}
\pgfpathmoveto{\pgfqpoint{0.273cm}{1.789cm}}
\pgfpathcurveto{\pgfqpoint{0.273cm}{1.825cm}}{\pgfqpoint{0.259cm}{1.86cm}}{\pgfqpoint{0.233cm}{1.886cm}}
\pgfpathcurveto{\pgfqpoint{0.207cm}{1.912cm}}{\pgfqpoint{0.173cm}{1.926cm}}{\pgfqpoint{0.137cm}{1.926cm}}
\pgfpathcurveto{\pgfqpoint{0.1cm}{1.926cm}}{\pgfqpoint{0.066cm}{1.912cm}}{\pgfqpoint{0.04cm}{1.886cm}}
\pgfpathcurveto{\pgfqpoint{0.014cm}{1.86cm}}{\pgfqpoint{0cm}{1.825cm}}{\pgfqpoint{0cm}{1.789cm}}
\pgfpathcurveto{\pgfqpoint{0cm}{1.753cm}}{\pgfqpoint{0.014cm}{1.718cm}}{\pgfqpoint{0.04cm}{1.692cm}}
\pgfpathcurveto{\pgfqpoint{0.066cm}{1.667cm}}{\pgfqpoint{0.1cm}{1.652cm}}{\pgfqpoint{0.137cm}{1.652cm}}
\pgfpathcurveto{\pgfqpoint{0.173cm}{1.652cm}}{\pgfqpoint{0.207cm}{1.667cm}}{\pgfqpoint{0.233cm}{1.692cm}}
\pgfpathcurveto{\pgfqpoint{0.259cm}{1.718cm}}{\pgfqpoint{0.273cm}{1.753cm}}{\pgfqpoint{0.273cm}{1.789cm}}
\pgfusepath{fill}
\begin{pgfscope}
\pgfsetdash{}{0cm}
\pgfsetlinewidth{0.818mm}
\pgfsetmiterlimit{7.0}
\pgfpathmoveto{\pgfqpoint{0.682cm}{1.065cm}}
\pgfpathlineto{\pgfqpoint{0.679cm}{1.812cm}}
\pgfusepath{stroke}
\end{pgfscope}
\pgfpathmoveto{\pgfqpoint{0.815cm}{1.793cm}}
\pgfpathcurveto{\pgfqpoint{0.815cm}{1.829cm}}{\pgfqpoint{0.801cm}{1.864cm}}{\pgfqpoint{0.775cm}{1.89cm}}
\pgfpathcurveto{\pgfqpoint{0.75cm}{1.915cm}}{\pgfqpoint{0.715cm}{1.93cm}}{\pgfqpoint{0.679cm}{1.93cm}}
\pgfpathcurveto{\pgfqpoint{0.643cm}{1.93cm}}{\pgfqpoint{0.608cm}{1.915cm}}{\pgfqpoint{0.582cm}{1.89cm}}
\pgfpathcurveto{\pgfqpoint{0.557cm}{1.864cm}}{\pgfqpoint{0.542cm}{1.829cm}}{\pgfqpoint{0.542cm}{1.793cm}}
\pgfpathcurveto{\pgfqpoint{0.542cm}{1.756cm}}{\pgfqpoint{0.557cm}{1.722cm}}{\pgfqpoint{0.582cm}{1.696cm}}
\pgfpathcurveto{\pgfqpoint{0.608cm}{1.67cm}}{\pgfqpoint{0.643cm}{1.656cm}}{\pgfqpoint{0.679cm}{1.656cm}}
\pgfpathcurveto{\pgfqpoint{0.715cm}{1.656cm}}{\pgfqpoint{0.75cm}{1.67cm}}{\pgfqpoint{0.775cm}{1.696cm}}
\pgfpathcurveto{\pgfqpoint{0.801cm}{1.722cm}}{\pgfqpoint{0.815cm}{1.756cm}}{\pgfqpoint{0.815cm}{1.793cm}}
\pgfusepath{fill}
\pgfpathmoveto{\pgfqpoint{1.345cm}{1.765cm}}
\pgfpathcurveto{\pgfqpoint{1.345cm}{1.801cm}}{\pgfqpoint{1.331cm}{1.836cm}}{\pgfqpoint{1.305cm}{1.862cm}}
\pgfpathcurveto{\pgfqpoint{1.28cm}{1.887cm}}{\pgfqpoint{1.245cm}{1.902cm}}{\pgfqpoint{1.209cm}{1.902cm}}
\pgfpathcurveto{\pgfqpoint{1.172cm}{1.902cm}}{\pgfqpoint{1.138cm}{1.887cm}}{\pgfqpoint{1.112cm}{1.862cm}}
\pgfpathcurveto{\pgfqpoint{1.087cm}{1.836cm}}{\pgfqpoint{1.072cm}{1.801cm}}{\pgfqpoint{1.072cm}{1.765cm}}
\pgfpathcurveto{\pgfqpoint{1.072cm}{1.728cm}}{\pgfqpoint{1.087cm}{1.694cm}}{\pgfqpoint{1.112cm}{1.668cm}}
\pgfpathcurveto{\pgfqpoint{1.138cm}{1.642cm}}{\pgfqpoint{1.172cm}{1.628cm}}{\pgfqpoint{1.209cm}{1.628cm}}
\pgfpathcurveto{\pgfqpoint{1.245cm}{1.628cm}}{\pgfqpoint{1.28cm}{1.642cm}}{\pgfqpoint{1.305cm}{1.668cm}}
\pgfpathcurveto{\pgfqpoint{1.331cm}{1.694cm}}{\pgfqpoint{1.345cm}{1.728cm}}{\pgfqpoint{1.345cm}{1.765cm}}
\pgfusepath{fill}
\begin{pgfscope}
\pgfsetdash{}{0cm}
\pgfsetlinewidth{0.818mm}
\pgfsetroundcap
\pgfsetroundjoin
\pgfsetmiterlimit{7.0}
\pgfpathmoveto{\pgfqpoint{0.682cm}{1.065cm}}
\pgfpathlineto{\pgfqpoint{1.246cm}{0.315cm}}
\pgfpathlineto{\pgfqpoint{1.811cm}{1.065cm}}
\pgfusepath{stroke}
\end{pgfscope}
\pgfpathmoveto{\pgfqpoint{1.948cm}{1.065cm}}
\pgfpathcurveto{\pgfqpoint{1.948cm}{1.101cm}}{\pgfqpoint{1.933cm}{1.136cm}}{\pgfqpoint{1.907cm}{1.162cm}}
\pgfpathcurveto{\pgfqpoint{1.882cm}{1.187cm}}{\pgfqpoint{1.847cm}{1.202cm}}{\pgfqpoint{1.811cm}{1.202cm}}
\pgfpathcurveto{\pgfqpoint{1.775cm}{1.202cm}}{\pgfqpoint{1.74cm}{1.187cm}}{\pgfqpoint{1.714cm}{1.162cm}}
\pgfpathcurveto{\pgfqpoint{1.689cm}{1.136cm}}{\pgfqpoint{1.674cm}{1.101cm}}{\pgfqpoint{1.674cm}{1.065cm}}
\pgfpathcurveto{\pgfqpoint{1.674cm}{1.029cm}}{\pgfqpoint{1.689cm}{0.994cm}}{\pgfqpoint{1.714cm}{0.968cm}}
\pgfpathcurveto{\pgfqpoint{1.74cm}{0.942cm}}{\pgfqpoint{1.775cm}{0.928cm}}{\pgfqpoint{1.811cm}{0.928cm}}
\pgfpathcurveto{\pgfqpoint{1.847cm}{0.928cm}}{\pgfqpoint{1.882cm}{0.942cm}}{\pgfqpoint{1.907cm}{0.968cm}}
\pgfpathcurveto{\pgfqpoint{1.933cm}{0.994cm}}{\pgfqpoint{1.948cm}{1.029cm}}{\pgfqpoint{1.948cm}{1.065cm}}
\pgfusepath{fill}
\begin{pgfscope}
\pgfsetdash{}{0cm}
\pgfsetlinewidth{0.818mm}
\pgfsetmiterlimit{4.0}
\pgfpathmoveto{\pgfqpoint{1.383cm}{0.178cm}}
\pgfpathcurveto{\pgfqpoint{1.383cm}{0.214cm}}{\pgfqpoint{1.369cm}{0.249cm}}{\pgfqpoint{1.343cm}{0.275cm}}
\pgfpathcurveto{\pgfqpoint{1.317cm}{0.3cm}}{\pgfqpoint{1.283cm}{0.315cm}}{\pgfqpoint{1.246cm}{0.315cm}}
\pgfpathcurveto{\pgfqpoint{1.21cm}{0.315cm}}{\pgfqpoint{1.175cm}{0.3cm}}{\pgfqpoint{1.15cm}{0.275cm}}
\pgfpathcurveto{\pgfqpoint{1.124cm}{0.249cm}}{\pgfqpoint{1.11cm}{0.214cm}}{\pgfqpoint{1.11cm}{0.178cm}}
\pgfpathcurveto{\pgfqpoint{1.11cm}{0.141cm}}{\pgfqpoint{1.124cm}{0.107cm}}{\pgfqpoint{1.15cm}{0.081cm}}
\pgfpathcurveto{\pgfqpoint{1.175cm}{0.055cm}}{\pgfqpoint{1.21cm}{0.041cm}}{\pgfqpoint{1.246cm}{0.041cm}}
\pgfpathcurveto{\pgfqpoint{1.283cm}{0.041cm}}{\pgfqpoint{1.317cm}{0.055cm}}{\pgfqpoint{1.343cm}{0.081cm}}
\pgfpathcurveto{\pgfqpoint{1.369cm}{0.107cm}}{\pgfqpoint{1.383cm}{0.141cm}}{\pgfqpoint{1.383cm}{0.178cm}}
\pgfusepath{stroke}
\end{pgfscope}
\end{pgfscope}
\end{pgfscope}
\end{pgfscope}
\end{tikzpicture}}}\|_{\CC^{-1+\alpha}(\rho)}
&\lesssim \|\phi+\psi\|_{L^\infty(\rho)}\|\UU_>X^{\!\resizebox{!}{.8em}{
\begin{tikzpicture}
\pgfpathmoveto{\pgfqpoint{0cm}{-0.035cm}}
\pgfpathlineto{\pgfqpoint{1.976cm}{-0.035cm}}
\pgfpathlineto{\pgfqpoint{1.976cm}{1.94cm}}
\pgfpathlineto{\pgfqpoint{0cm}{1.94cm}}
\pgfpathclose
\pgfusepath{clip}
\begin{pgfscope}
\begin{pgfscope}
\pgfpathmoveto{\pgfqpoint{0cm}{-0.035cm}}
\pgfpathlineto{\pgfqpoint{1.976cm}{-0.035cm}}
\pgfpathlineto{\pgfqpoint{1.976cm}{1.94cm}}
\pgfpathlineto{\pgfqpoint{0cm}{1.94cm}}
\pgfpathclose
\pgfusepath{clip}
\begin{pgfscope}
\begin{pgfscope}
\pgfsetdash{}{0cm}
\pgfsetlinewidth{0.818mm}
\pgfsetroundcap
\pgfsetroundjoin
\pgfsetmiterlimit{7.0}
\definecolor{eps2pgf_color}{gray}{0}\pgfsetstrokecolor{eps2pgf_color}\pgfsetfillcolor{eps2pgf_color}
\pgfpathmoveto{\pgfqpoint{0.117cm}{1.815cm}}
\pgfpathlineto{\pgfqpoint{0.682cm}{1.065cm}}
\pgfpathlineto{\pgfqpoint{1.246cm}{1.815cm}}
\pgfusepath{stroke}
\end{pgfscope}
\definecolor{eps2pgf_color}{gray}{0}\pgfsetstrokecolor{eps2pgf_color}\pgfsetfillcolor{eps2pgf_color}
\pgfpathmoveto{\pgfqpoint{0.273cm}{1.789cm}}
\pgfpathcurveto{\pgfqpoint{0.273cm}{1.825cm}}{\pgfqpoint{0.259cm}{1.86cm}}{\pgfqpoint{0.233cm}{1.886cm}}
\pgfpathcurveto{\pgfqpoint{0.207cm}{1.912cm}}{\pgfqpoint{0.173cm}{1.926cm}}{\pgfqpoint{0.137cm}{1.926cm}}
\pgfpathcurveto{\pgfqpoint{0.1cm}{1.926cm}}{\pgfqpoint{0.066cm}{1.912cm}}{\pgfqpoint{0.04cm}{1.886cm}}
\pgfpathcurveto{\pgfqpoint{0.014cm}{1.86cm}}{\pgfqpoint{0cm}{1.825cm}}{\pgfqpoint{0cm}{1.789cm}}
\pgfpathcurveto{\pgfqpoint{0cm}{1.753cm}}{\pgfqpoint{0.014cm}{1.718cm}}{\pgfqpoint{0.04cm}{1.692cm}}
\pgfpathcurveto{\pgfqpoint{0.066cm}{1.667cm}}{\pgfqpoint{0.1cm}{1.652cm}}{\pgfqpoint{0.137cm}{1.652cm}}
\pgfpathcurveto{\pgfqpoint{0.173cm}{1.652cm}}{\pgfqpoint{0.207cm}{1.667cm}}{\pgfqpoint{0.233cm}{1.692cm}}
\pgfpathcurveto{\pgfqpoint{0.259cm}{1.718cm}}{\pgfqpoint{0.273cm}{1.753cm}}{\pgfqpoint{0.273cm}{1.789cm}}
\pgfusepath{fill}
\begin{pgfscope}
\pgfsetdash{}{0cm}
\pgfsetlinewidth{0.818mm}
\pgfsetmiterlimit{7.0}
\pgfpathmoveto{\pgfqpoint{0.682cm}{1.065cm}}
\pgfpathlineto{\pgfqpoint{0.679cm}{1.812cm}}
\pgfusepath{stroke}
\end{pgfscope}
\pgfpathmoveto{\pgfqpoint{0.815cm}{1.793cm}}
\pgfpathcurveto{\pgfqpoint{0.815cm}{1.829cm}}{\pgfqpoint{0.801cm}{1.864cm}}{\pgfqpoint{0.775cm}{1.89cm}}
\pgfpathcurveto{\pgfqpoint{0.75cm}{1.915cm}}{\pgfqpoint{0.715cm}{1.93cm}}{\pgfqpoint{0.679cm}{1.93cm}}
\pgfpathcurveto{\pgfqpoint{0.643cm}{1.93cm}}{\pgfqpoint{0.608cm}{1.915cm}}{\pgfqpoint{0.582cm}{1.89cm}}
\pgfpathcurveto{\pgfqpoint{0.557cm}{1.864cm}}{\pgfqpoint{0.542cm}{1.829cm}}{\pgfqpoint{0.542cm}{1.793cm}}
\pgfpathcurveto{\pgfqpoint{0.542cm}{1.756cm}}{\pgfqpoint{0.557cm}{1.722cm}}{\pgfqpoint{0.582cm}{1.696cm}}
\pgfpathcurveto{\pgfqpoint{0.608cm}{1.67cm}}{\pgfqpoint{0.643cm}{1.656cm}}{\pgfqpoint{0.679cm}{1.656cm}}
\pgfpathcurveto{\pgfqpoint{0.715cm}{1.656cm}}{\pgfqpoint{0.75cm}{1.67cm}}{\pgfqpoint{0.775cm}{1.696cm}}
\pgfpathcurveto{\pgfqpoint{0.801cm}{1.722cm}}{\pgfqpoint{0.815cm}{1.756cm}}{\pgfqpoint{0.815cm}{1.793cm}}
\pgfusepath{fill}
\pgfpathmoveto{\pgfqpoint{1.345cm}{1.765cm}}
\pgfpathcurveto{\pgfqpoint{1.345cm}{1.801cm}}{\pgfqpoint{1.331cm}{1.836cm}}{\pgfqpoint{1.305cm}{1.862cm}}
\pgfpathcurveto{\pgfqpoint{1.28cm}{1.887cm}}{\pgfqpoint{1.245cm}{1.902cm}}{\pgfqpoint{1.209cm}{1.902cm}}
\pgfpathcurveto{\pgfqpoint{1.172cm}{1.902cm}}{\pgfqpoint{1.138cm}{1.887cm}}{\pgfqpoint{1.112cm}{1.862cm}}
\pgfpathcurveto{\pgfqpoint{1.087cm}{1.836cm}}{\pgfqpoint{1.072cm}{1.801cm}}{\pgfqpoint{1.072cm}{1.765cm}}
\pgfpathcurveto{\pgfqpoint{1.072cm}{1.728cm}}{\pgfqpoint{1.087cm}{1.694cm}}{\pgfqpoint{1.112cm}{1.668cm}}
\pgfpathcurveto{\pgfqpoint{1.138cm}{1.642cm}}{\pgfqpoint{1.172cm}{1.628cm}}{\pgfqpoint{1.209cm}{1.628cm}}
\pgfpathcurveto{\pgfqpoint{1.245cm}{1.628cm}}{\pgfqpoint{1.28cm}{1.642cm}}{\pgfqpoint{1.305cm}{1.668cm}}
\pgfpathcurveto{\pgfqpoint{1.331cm}{1.694cm}}{\pgfqpoint{1.345cm}{1.728cm}}{\pgfqpoint{1.345cm}{1.765cm}}
\pgfusepath{fill}
\begin{pgfscope}
\pgfsetdash{}{0cm}
\pgfsetlinewidth{0.818mm}
\pgfsetroundcap
\pgfsetroundjoin
\pgfsetmiterlimit{7.0}
\pgfpathmoveto{\pgfqpoint{0.682cm}{1.065cm}}
\pgfpathlineto{\pgfqpoint{1.246cm}{0.315cm}}
\pgfpathlineto{\pgfqpoint{1.811cm}{1.065cm}}
\pgfusepath{stroke}
\end{pgfscope}
\pgfpathmoveto{\pgfqpoint{1.948cm}{1.065cm}}
\pgfpathcurveto{\pgfqpoint{1.948cm}{1.101cm}}{\pgfqpoint{1.933cm}{1.136cm}}{\pgfqpoint{1.907cm}{1.162cm}}
\pgfpathcurveto{\pgfqpoint{1.882cm}{1.187cm}}{\pgfqpoint{1.847cm}{1.202cm}}{\pgfqpoint{1.811cm}{1.202cm}}
\pgfpathcurveto{\pgfqpoint{1.775cm}{1.202cm}}{\pgfqpoint{1.74cm}{1.187cm}}{\pgfqpoint{1.714cm}{1.162cm}}
\pgfpathcurveto{\pgfqpoint{1.689cm}{1.136cm}}{\pgfqpoint{1.674cm}{1.101cm}}{\pgfqpoint{1.674cm}{1.065cm}}
\pgfpathcurveto{\pgfqpoint{1.674cm}{1.029cm}}{\pgfqpoint{1.689cm}{0.994cm}}{\pgfqpoint{1.714cm}{0.968cm}}
\pgfpathcurveto{\pgfqpoint{1.74cm}{0.942cm}}{\pgfqpoint{1.775cm}{0.928cm}}{\pgfqpoint{1.811cm}{0.928cm}}
\pgfpathcurveto{\pgfqpoint{1.847cm}{0.928cm}}{\pgfqpoint{1.882cm}{0.942cm}}{\pgfqpoint{1.907cm}{0.968cm}}
\pgfpathcurveto{\pgfqpoint{1.933cm}{0.994cm}}{\pgfqpoint{1.948cm}{1.029cm}}{\pgfqpoint{1.948cm}{1.065cm}}
\pgfusepath{fill}
\begin{pgfscope}
\pgfsetdash{}{0cm}
\pgfsetlinewidth{0.818mm}
\pgfsetmiterlimit{4.0}
\pgfpathmoveto{\pgfqpoint{1.383cm}{0.178cm}}
\pgfpathcurveto{\pgfqpoint{1.383cm}{0.214cm}}{\pgfqpoint{1.369cm}{0.249cm}}{\pgfqpoint{1.343cm}{0.275cm}}
\pgfpathcurveto{\pgfqpoint{1.317cm}{0.3cm}}{\pgfqpoint{1.283cm}{0.315cm}}{\pgfqpoint{1.246cm}{0.315cm}}
\pgfpathcurveto{\pgfqpoint{1.21cm}{0.315cm}}{\pgfqpoint{1.175cm}{0.3cm}}{\pgfqpoint{1.15cm}{0.275cm}}
\pgfpathcurveto{\pgfqpoint{1.124cm}{0.249cm}}{\pgfqpoint{1.11cm}{0.214cm}}{\pgfqpoint{1.11cm}{0.178cm}}
\pgfpathcurveto{\pgfqpoint{1.11cm}{0.141cm}}{\pgfqpoint{1.124cm}{0.107cm}}{\pgfqpoint{1.15cm}{0.081cm}}
\pgfpathcurveto{\pgfqpoint{1.175cm}{0.055cm}}{\pgfqpoint{1.21cm}{0.041cm}}{\pgfqpoint{1.246cm}{0.041cm}}
\pgfpathcurveto{\pgfqpoint{1.283cm}{0.041cm}}{\pgfqpoint{1.317cm}{0.055cm}}{\pgfqpoint{1.343cm}{0.081cm}}
\pgfpathcurveto{\pgfqpoint{1.369cm}{0.107cm}}{\pgfqpoint{1.383cm}{0.141cm}}{\pgfqpoint{1.383cm}{0.178cm}}
\pgfusepath{stroke}
\end{pgfscope}
\end{pgfscope}
\end{pgfscope}
\end{pgfscope}
\end{tikzpicture}}}\|_{\CC^{-1+\alpha}}\\
&\lesssim 2^{-(1-\alpha-\kappa)K/2}\|\phi+\psi\|_{L^\infty(\rho)}\\
&\lesssim 1+\|\psi\|_{L^\infty(\rho)}^\varepsilon,
\end{align*}
and finally
\begin{align*}
\|3\UU_> X\succ(\phi+\psi)^2\|_{\CC^{-1+\alpha}(\rho^{2+\alpha})}
&\lesssim \|\phi+\psi\|^2_{L^\infty(\rho)}\|\UU_>X\|_{\CC^{-1+\alpha}(\rho^\alpha)}\\
&\lesssim 2^{-(\frac{1}{2}-\alpha-\kappa)\frac{4}{3}K}\|\phi+\psi\|_{L^\infty(\rho)}^2\\
&\lesssim 1+\|\psi\|_{L^\infty(\rho)}^{1+\varepsilon}.
\end{align*}
Hence, we have shown that
\begin{align*}
\|\Phi +3\llbracket X^2 \rrbracket\succX^{\!\resizebox{0.6em}{!}{
\begin{tikzpicture}
\pgfpathmoveto{\pgfqpoint{0cm}{-0.035cm}}
\pgfpathlineto{\pgfqpoint{1.376cm}{-0.035cm}}
\pgfpathlineto{\pgfqpoint{1.376cm}{1.552cm}}
\pgfpathlineto{\pgfqpoint{0cm}{1.552cm}}
\pgfpathclose
\pgfusepath{clip}
\begin{pgfscope}
\begin{pgfscope}
\pgfpathmoveto{\pgfqpoint{0cm}{-0.035cm}}
\pgfpathlineto{\pgfqpoint{1.376cm}{-0.035cm}}
\pgfpathlineto{\pgfqpoint{1.376cm}{1.552cm}}
\pgfpathlineto{\pgfqpoint{0cm}{1.552cm}}
\pgfpathclose
\pgfusepath{clip}
\begin{pgfscope}
\begin{pgfscope}
\pgfsetdash{}{0cm}
\pgfsetlinewidth{0.818mm}
\pgfsetroundcap
\pgfsetroundjoin
\pgfsetmiterlimit{7.0}
\definecolor{eps2pgf_color}{gray}{0}\pgfsetstrokecolor{eps2pgf_color}\pgfsetfillcolor{eps2pgf_color}
\pgfpathmoveto{\pgfqpoint{0.117cm}{1.421cm}}
\pgfpathlineto{\pgfqpoint{0.682cm}{0.671cm}}
\pgfpathlineto{\pgfqpoint{1.246cm}{1.421cm}}
\pgfusepath{stroke}
\end{pgfscope}
\definecolor{eps2pgf_color}{gray}{0}\pgfsetstrokecolor{eps2pgf_color}\pgfsetfillcolor{eps2pgf_color}
\pgfpathmoveto{\pgfqpoint{0.273cm}{1.395cm}}
\pgfpathcurveto{\pgfqpoint{0.273cm}{1.432cm}}{\pgfqpoint{0.259cm}{1.467cm}}{\pgfqpoint{0.233cm}{1.492cm}}
\pgfpathcurveto{\pgfqpoint{0.207cm}{1.518cm}}{\pgfqpoint{0.173cm}{1.532cm}}{\pgfqpoint{0.137cm}{1.532cm}}
\pgfpathcurveto{\pgfqpoint{0.1cm}{1.532cm}}{\pgfqpoint{0.066cm}{1.518cm}}{\pgfqpoint{0.04cm}{1.492cm}}
\pgfpathcurveto{\pgfqpoint{0.014cm}{1.467cm}}{\pgfqpoint{0cm}{1.432cm}}{\pgfqpoint{0cm}{1.395cm}}
\pgfpathcurveto{\pgfqpoint{0cm}{1.359cm}}{\pgfqpoint{0.014cm}{1.324cm}}{\pgfqpoint{0.04cm}{1.299cm}}
\pgfpathcurveto{\pgfqpoint{0.066cm}{1.273cm}}{\pgfqpoint{0.1cm}{1.258cm}}{\pgfqpoint{0.137cm}{1.258cm}}
\pgfpathcurveto{\pgfqpoint{0.173cm}{1.258cm}}{\pgfqpoint{0.207cm}{1.273cm}}{\pgfqpoint{0.233cm}{1.299cm}}
\pgfpathcurveto{\pgfqpoint{0.259cm}{1.324cm}}{\pgfqpoint{0.273cm}{1.359cm}}{\pgfqpoint{0.273cm}{1.395cm}}
\pgfusepath{fill}
\begin{pgfscope}
\pgfsetdash{}{0cm}
\pgfsetlinewidth{0.818mm}
\pgfsetmiterlimit{7.0}
\pgfpathmoveto{\pgfqpoint{0.682cm}{0.671cm}}
\pgfpathlineto{\pgfqpoint{0.679cm}{1.418cm}}
\pgfusepath{stroke}
\end{pgfscope}
\pgfpathmoveto{\pgfqpoint{0.815cm}{1.399cm}}
\pgfpathcurveto{\pgfqpoint{0.815cm}{1.435cm}}{\pgfqpoint{0.801cm}{1.47cm}}{\pgfqpoint{0.775cm}{1.496cm}}
\pgfpathcurveto{\pgfqpoint{0.75cm}{1.521cm}}{\pgfqpoint{0.715cm}{1.536cm}}{\pgfqpoint{0.679cm}{1.536cm}}
\pgfpathcurveto{\pgfqpoint{0.643cm}{1.536cm}}{\pgfqpoint{0.608cm}{1.521cm}}{\pgfqpoint{0.582cm}{1.496cm}}
\pgfpathcurveto{\pgfqpoint{0.557cm}{1.47cm}}{\pgfqpoint{0.542cm}{1.435cm}}{\pgfqpoint{0.542cm}{1.399cm}}
\pgfpathcurveto{\pgfqpoint{0.542cm}{1.363cm}}{\pgfqpoint{0.557cm}{1.328cm}}{\pgfqpoint{0.582cm}{1.302cm}}
\pgfpathcurveto{\pgfqpoint{0.608cm}{1.276cm}}{\pgfqpoint{0.643cm}{1.262cm}}{\pgfqpoint{0.679cm}{1.262cm}}
\pgfpathcurveto{\pgfqpoint{0.715cm}{1.262cm}}{\pgfqpoint{0.75cm}{1.276cm}}{\pgfqpoint{0.775cm}{1.302cm}}
\pgfpathcurveto{\pgfqpoint{0.801cm}{1.328cm}}{\pgfqpoint{0.815cm}{1.363cm}}{\pgfqpoint{0.815cm}{1.399cm}}
\pgfusepath{fill}
\pgfpathmoveto{\pgfqpoint{1.345cm}{1.371cm}}
\pgfpathcurveto{\pgfqpoint{1.345cm}{1.408cm}}{\pgfqpoint{1.331cm}{1.442cm}}{\pgfqpoint{1.305cm}{1.468cm}}
\pgfpathcurveto{\pgfqpoint{1.28cm}{1.494cm}}{\pgfqpoint{1.245cm}{1.508cm}}{\pgfqpoint{1.209cm}{1.508cm}}
\pgfpathcurveto{\pgfqpoint{1.172cm}{1.508cm}}{\pgfqpoint{1.138cm}{1.494cm}}{\pgfqpoint{1.112cm}{1.468cm}}
\pgfpathcurveto{\pgfqpoint{1.087cm}{1.442cm}}{\pgfqpoint{1.072cm}{1.408cm}}{\pgfqpoint{1.072cm}{1.371cm}}
\pgfpathcurveto{\pgfqpoint{1.072cm}{1.335cm}}{\pgfqpoint{1.087cm}{1.3cm}}{\pgfqpoint{1.112cm}{1.274cm}}
\pgfpathcurveto{\pgfqpoint{1.138cm}{1.249cm}}{\pgfqpoint{1.172cm}{1.234cm}}{\pgfqpoint{1.209cm}{1.234cm}}
\pgfpathcurveto{\pgfqpoint{1.245cm}{1.234cm}}{\pgfqpoint{1.28cm}{1.249cm}}{\pgfqpoint{1.305cm}{1.274cm}}
\pgfpathcurveto{\pgfqpoint{1.331cm}{1.3cm}}{\pgfqpoint{1.345cm}{1.335cm}}{\pgfqpoint{1.345cm}{1.371cm}}
\pgfusepath{fill}
\begin{pgfscope}
\pgfsetdash{}{0cm}
\pgfsetlinewidth{0.818mm}
\pgfsetroundcap
\pgfsetmiterlimit{4.0}
\pgfpathmoveto{\pgfqpoint{0.682cm}{0.671cm}}
\pgfpathlineto{\pgfqpoint{0.682cm}{0.042cm}}
\pgfusepath{stroke}
\end{pgfscope}
\end{pgfscope}
\end{pgfscope}
\end{pgfscope}
\end{tikzpicture}}}-3\UU_>\llbracket X^2 \rrbracket\succ(\phi+\psi)\|_{\CC^{-1+\alpha}(\rho^{2+\alpha})}\lesssim 1+\|\psi\|_{L^\infty(\rho)}^{1+\varepsilon}+ \|\psi\|_{\CC^{\frac{1}{2}+\alpha}(\rho^{\frac{3}{2}+\alpha})} .
\end{align*}
Similarly,
\begin{align*}
\|3\UU_\leq\llbracket X^2 \rrbracket\succ(\phi+\psi)\|_{\CC^{-1+\alpha}(\rho^{2+\alpha})}&\lesssim \|\phi+\psi\|_{L^\infty(\rho)}\|\UU_\leq\llbracket X^2 \rrbracket\|_{\CC^{-1+\alpha}(\rho^{1+\alpha})}\\
&\lesssim 2^{(\alpha+\kappa)K}(1+\|\psi\|_{L^\infty(\rho)} )\\
&\lesssim 1+\|\psi\|_{L^\infty(\rho)}^{1+\varepsilon},
\end{align*}
and for the commutator, we obtain
\begin{align*}
&\|3[\Q,(-X^{\!\resizebox{0.6em}{!}{
\begin{tikzpicture}
\pgfpathmoveto{\pgfqpoint{0cm}{-0.035cm}}
\pgfpathlineto{\pgfqpoint{1.376cm}{-0.035cm}}
\pgfpathlineto{\pgfqpoint{1.376cm}{1.552cm}}
\pgfpathlineto{\pgfqpoint{0cm}{1.552cm}}
\pgfpathclose
\pgfusepath{clip}
\begin{pgfscope}
\begin{pgfscope}
\pgfpathmoveto{\pgfqpoint{0cm}{-0.035cm}}
\pgfpathlineto{\pgfqpoint{1.376cm}{-0.035cm}}
\pgfpathlineto{\pgfqpoint{1.376cm}{1.552cm}}
\pgfpathlineto{\pgfqpoint{0cm}{1.552cm}}
\pgfpathclose
\pgfusepath{clip}
\begin{pgfscope}
\begin{pgfscope}
\pgfsetdash{}{0cm}
\pgfsetlinewidth{0.818mm}
\pgfsetroundcap
\pgfsetroundjoin
\pgfsetmiterlimit{7.0}
\definecolor{eps2pgf_color}{gray}{0}\pgfsetstrokecolor{eps2pgf_color}\pgfsetfillcolor{eps2pgf_color}
\pgfpathmoveto{\pgfqpoint{0.117cm}{1.421cm}}
\pgfpathlineto{\pgfqpoint{0.682cm}{0.671cm}}
\pgfpathlineto{\pgfqpoint{1.246cm}{1.421cm}}
\pgfusepath{stroke}
\end{pgfscope}
\definecolor{eps2pgf_color}{gray}{0}\pgfsetstrokecolor{eps2pgf_color}\pgfsetfillcolor{eps2pgf_color}
\pgfpathmoveto{\pgfqpoint{0.273cm}{1.395cm}}
\pgfpathcurveto{\pgfqpoint{0.273cm}{1.432cm}}{\pgfqpoint{0.259cm}{1.467cm}}{\pgfqpoint{0.233cm}{1.492cm}}
\pgfpathcurveto{\pgfqpoint{0.207cm}{1.518cm}}{\pgfqpoint{0.173cm}{1.532cm}}{\pgfqpoint{0.137cm}{1.532cm}}
\pgfpathcurveto{\pgfqpoint{0.1cm}{1.532cm}}{\pgfqpoint{0.066cm}{1.518cm}}{\pgfqpoint{0.04cm}{1.492cm}}
\pgfpathcurveto{\pgfqpoint{0.014cm}{1.467cm}}{\pgfqpoint{0cm}{1.432cm}}{\pgfqpoint{0cm}{1.395cm}}
\pgfpathcurveto{\pgfqpoint{0cm}{1.359cm}}{\pgfqpoint{0.014cm}{1.324cm}}{\pgfqpoint{0.04cm}{1.299cm}}
\pgfpathcurveto{\pgfqpoint{0.066cm}{1.273cm}}{\pgfqpoint{0.1cm}{1.258cm}}{\pgfqpoint{0.137cm}{1.258cm}}
\pgfpathcurveto{\pgfqpoint{0.173cm}{1.258cm}}{\pgfqpoint{0.207cm}{1.273cm}}{\pgfqpoint{0.233cm}{1.299cm}}
\pgfpathcurveto{\pgfqpoint{0.259cm}{1.324cm}}{\pgfqpoint{0.273cm}{1.359cm}}{\pgfqpoint{0.273cm}{1.395cm}}
\pgfusepath{fill}
\begin{pgfscope}
\pgfsetdash{}{0cm}
\pgfsetlinewidth{0.818mm}
\pgfsetmiterlimit{7.0}
\pgfpathmoveto{\pgfqpoint{0.682cm}{0.671cm}}
\pgfpathlineto{\pgfqpoint{0.679cm}{1.418cm}}
\pgfusepath{stroke}
\end{pgfscope}
\pgfpathmoveto{\pgfqpoint{0.815cm}{1.399cm}}
\pgfpathcurveto{\pgfqpoint{0.815cm}{1.435cm}}{\pgfqpoint{0.801cm}{1.47cm}}{\pgfqpoint{0.775cm}{1.496cm}}
\pgfpathcurveto{\pgfqpoint{0.75cm}{1.521cm}}{\pgfqpoint{0.715cm}{1.536cm}}{\pgfqpoint{0.679cm}{1.536cm}}
\pgfpathcurveto{\pgfqpoint{0.643cm}{1.536cm}}{\pgfqpoint{0.608cm}{1.521cm}}{\pgfqpoint{0.582cm}{1.496cm}}
\pgfpathcurveto{\pgfqpoint{0.557cm}{1.47cm}}{\pgfqpoint{0.542cm}{1.435cm}}{\pgfqpoint{0.542cm}{1.399cm}}
\pgfpathcurveto{\pgfqpoint{0.542cm}{1.363cm}}{\pgfqpoint{0.557cm}{1.328cm}}{\pgfqpoint{0.582cm}{1.302cm}}
\pgfpathcurveto{\pgfqpoint{0.608cm}{1.276cm}}{\pgfqpoint{0.643cm}{1.262cm}}{\pgfqpoint{0.679cm}{1.262cm}}
\pgfpathcurveto{\pgfqpoint{0.715cm}{1.262cm}}{\pgfqpoint{0.75cm}{1.276cm}}{\pgfqpoint{0.775cm}{1.302cm}}
\pgfpathcurveto{\pgfqpoint{0.801cm}{1.328cm}}{\pgfqpoint{0.815cm}{1.363cm}}{\pgfqpoint{0.815cm}{1.399cm}}
\pgfusepath{fill}
\pgfpathmoveto{\pgfqpoint{1.345cm}{1.371cm}}
\pgfpathcurveto{\pgfqpoint{1.345cm}{1.408cm}}{\pgfqpoint{1.331cm}{1.442cm}}{\pgfqpoint{1.305cm}{1.468cm}}
\pgfpathcurveto{\pgfqpoint{1.28cm}{1.494cm}}{\pgfqpoint{1.245cm}{1.508cm}}{\pgfqpoint{1.209cm}{1.508cm}}
\pgfpathcurveto{\pgfqpoint{1.172cm}{1.508cm}}{\pgfqpoint{1.138cm}{1.494cm}}{\pgfqpoint{1.112cm}{1.468cm}}
\pgfpathcurveto{\pgfqpoint{1.087cm}{1.442cm}}{\pgfqpoint{1.072cm}{1.408cm}}{\pgfqpoint{1.072cm}{1.371cm}}
\pgfpathcurveto{\pgfqpoint{1.072cm}{1.335cm}}{\pgfqpoint{1.087cm}{1.3cm}}{\pgfqpoint{1.112cm}{1.274cm}}
\pgfpathcurveto{\pgfqpoint{1.138cm}{1.249cm}}{\pgfqpoint{1.172cm}{1.234cm}}{\pgfqpoint{1.209cm}{1.234cm}}
\pgfpathcurveto{\pgfqpoint{1.245cm}{1.234cm}}{\pgfqpoint{1.28cm}{1.249cm}}{\pgfqpoint{1.305cm}{1.274cm}}
\pgfpathcurveto{\pgfqpoint{1.331cm}{1.3cm}}{\pgfqpoint{1.345cm}{1.335cm}}{\pgfqpoint{1.345cm}{1.371cm}}
\pgfusepath{fill}
\begin{pgfscope}
\pgfsetdash{}{0cm}
\pgfsetlinewidth{0.818mm}
\pgfsetroundcap
\pgfsetmiterlimit{4.0}
\pgfpathmoveto{\pgfqpoint{0.682cm}{0.671cm}}
\pgfpathlineto{\pgfqpoint{0.682cm}{0.042cm}}
\pgfusepath{stroke}
\end{pgfscope}
\end{pgfscope}
\end{pgfscope}
\end{pgfscope}
\end{tikzpicture}}}+\phi+\psi)\prec]X^{\!\resizebox{0.6em}{!}{
\begin{tikzpicture}
\pgfpathmoveto{\pgfqpoint{0cm}{0cm}}
\pgfpathlineto{\pgfqpoint{1.376cm}{0cm}}
\pgfpathlineto{\pgfqpoint{1.376cm}{1.588cm}}
\pgfpathlineto{\pgfqpoint{0cm}{1.588cm}}
\pgfpathclose
\pgfusepath{clip}
\begin{pgfscope}
\begin{pgfscope}
\pgfpathmoveto{\pgfqpoint{0cm}{0cm}}
\pgfpathlineto{\pgfqpoint{1.376cm}{0cm}}
\pgfpathlineto{\pgfqpoint{1.376cm}{1.588cm}}
\pgfpathlineto{\pgfqpoint{0cm}{1.588cm}}
\pgfpathclose
\pgfusepath{clip}
\begin{pgfscope}
\begin{pgfscope}
\definecolor{eps2pgf_color}{gray}{0.976471}\pgfsetstrokecolor{eps2pgf_color}\pgfsetfillcolor{eps2pgf_color}
\pgfpathmoveto{\pgfqpoint{0cm}{0cm}}
\pgfpathlineto{\pgfqpoint{1.376cm}{0cm}}
\pgfpathlineto{\pgfqpoint{1.376cm}{1.588cm}}
\pgfpathlineto{\pgfqpoint{0cm}{1.588cm}}
\pgfpathclose
\pgfusepath{fill}
\end{pgfscope}
\begin{pgfscope}
\pgfsetdash{}{0cm}
\pgfsetlinewidth{0.818mm}
\pgfsetroundcap
\pgfsetroundjoin
\pgfsetmiterlimit{7.0}
\definecolor{eps2pgf_color}{gray}{0}\pgfsetstrokecolor{eps2pgf_color}\pgfsetfillcolor{eps2pgf_color}
\pgfpathmoveto{\pgfqpoint{0.117cm}{1.476cm}}
\pgfpathlineto{\pgfqpoint{0.682cm}{0.726cm}}
\pgfpathlineto{\pgfqpoint{1.246cm}{1.476cm}}
\pgfusepath{stroke}
\end{pgfscope}
\definecolor{eps2pgf_color}{gray}{0}\pgfsetstrokecolor{eps2pgf_color}\pgfsetfillcolor{eps2pgf_color}
\pgfpathmoveto{\pgfqpoint{0.273cm}{1.451cm}}
\pgfpathcurveto{\pgfqpoint{0.273cm}{1.487cm}}{\pgfqpoint{0.259cm}{1.522cm}}{\pgfqpoint{0.233cm}{1.547cm}}
\pgfpathcurveto{\pgfqpoint{0.207cm}{1.573cm}}{\pgfqpoint{0.173cm}{1.588cm}}{\pgfqpoint{0.137cm}{1.588cm}}
\pgfpathcurveto{\pgfqpoint{0.1cm}{1.588cm}}{\pgfqpoint{0.066cm}{1.573cm}}{\pgfqpoint{0.04cm}{1.547cm}}
\pgfpathcurveto{\pgfqpoint{0.014cm}{1.522cm}}{\pgfqpoint{0cm}{1.487cm}}{\pgfqpoint{0cm}{1.451cm}}
\pgfpathcurveto{\pgfqpoint{0cm}{1.414cm}}{\pgfqpoint{0.014cm}{1.379cm}}{\pgfqpoint{0.04cm}{1.354cm}}
\pgfpathcurveto{\pgfqpoint{0.066cm}{1.328cm}}{\pgfqpoint{0.1cm}{1.314cm}}{\pgfqpoint{0.137cm}{1.314cm}}
\pgfpathcurveto{\pgfqpoint{0.173cm}{1.314cm}}{\pgfqpoint{0.207cm}{1.328cm}}{\pgfqpoint{0.233cm}{1.354cm}}
\pgfpathcurveto{\pgfqpoint{0.259cm}{1.379cm}}{\pgfqpoint{0.273cm}{1.414cm}}{\pgfqpoint{0.273cm}{1.451cm}}
\pgfusepath{fill}
\pgfpathmoveto{\pgfqpoint{1.345cm}{1.426cm}}
\pgfpathcurveto{\pgfqpoint{1.345cm}{1.463cm}}{\pgfqpoint{1.331cm}{1.497cm}}{\pgfqpoint{1.305cm}{1.523cm}}
\pgfpathcurveto{\pgfqpoint{1.28cm}{1.549cm}}{\pgfqpoint{1.245cm}{1.563cm}}{\pgfqpoint{1.209cm}{1.563cm}}
\pgfpathcurveto{\pgfqpoint{1.172cm}{1.563cm}}{\pgfqpoint{1.138cm}{1.549cm}}{\pgfqpoint{1.112cm}{1.523cm}}
\pgfpathcurveto{\pgfqpoint{1.087cm}{1.497cm}}{\pgfqpoint{1.072cm}{1.463cm}}{\pgfqpoint{1.072cm}{1.426cm}}
\pgfpathcurveto{\pgfqpoint{1.072cm}{1.39cm}}{\pgfqpoint{1.087cm}{1.355cm}}{\pgfqpoint{1.112cm}{1.329cm}}
\pgfpathcurveto{\pgfqpoint{1.138cm}{1.304cm}}{\pgfqpoint{1.172cm}{1.289cm}}{\pgfqpoint{1.209cm}{1.289cm}}
\pgfpathcurveto{\pgfqpoint{1.245cm}{1.289cm}}{\pgfqpoint{1.28cm}{1.304cm}}{\pgfqpoint{1.305cm}{1.329cm}}
\pgfpathcurveto{\pgfqpoint{1.331cm}{1.355cm}}{\pgfqpoint{1.345cm}{1.39cm}}{\pgfqpoint{1.345cm}{1.426cm}}
\pgfusepath{fill}
\begin{pgfscope}
\pgfsetdash{}{0cm}
\pgfsetlinewidth{0.818mm}
\pgfsetroundcap
\pgfsetmiterlimit{4.0}
\pgfpathmoveto{\pgfqpoint{0.682cm}{0.726cm}}
\pgfpathlineto{\pgfqpoint{0.682cm}{0.097cm}}
\pgfusepath{stroke}
\end{pgfscope}
\end{pgfscope}
\end{pgfscope}
\end{pgfscope}
\end{tikzpicture}}}||_{\CC^{-1+\alpha}(\rho^{2+\alpha})}\\
&\quad=\|3\Delta(-X^{\!\resizebox{0.6em}{!}{
\begin{tikzpicture}
\pgfpathmoveto{\pgfqpoint{0cm}{-0.035cm}}
\pgfpathlineto{\pgfqpoint{1.376cm}{-0.035cm}}
\pgfpathlineto{\pgfqpoint{1.376cm}{1.552cm}}
\pgfpathlineto{\pgfqpoint{0cm}{1.552cm}}
\pgfpathclose
\pgfusepath{clip}
\begin{pgfscope}
\begin{pgfscope}
\pgfpathmoveto{\pgfqpoint{0cm}{-0.035cm}}
\pgfpathlineto{\pgfqpoint{1.376cm}{-0.035cm}}
\pgfpathlineto{\pgfqpoint{1.376cm}{1.552cm}}
\pgfpathlineto{\pgfqpoint{0cm}{1.552cm}}
\pgfpathclose
\pgfusepath{clip}
\begin{pgfscope}
\begin{pgfscope}
\pgfsetdash{}{0cm}
\pgfsetlinewidth{0.818mm}
\pgfsetroundcap
\pgfsetroundjoin
\pgfsetmiterlimit{7.0}
\definecolor{eps2pgf_color}{gray}{0}\pgfsetstrokecolor{eps2pgf_color}\pgfsetfillcolor{eps2pgf_color}
\pgfpathmoveto{\pgfqpoint{0.117cm}{1.421cm}}
\pgfpathlineto{\pgfqpoint{0.682cm}{0.671cm}}
\pgfpathlineto{\pgfqpoint{1.246cm}{1.421cm}}
\pgfusepath{stroke}
\end{pgfscope}
\definecolor{eps2pgf_color}{gray}{0}\pgfsetstrokecolor{eps2pgf_color}\pgfsetfillcolor{eps2pgf_color}
\pgfpathmoveto{\pgfqpoint{0.273cm}{1.395cm}}
\pgfpathcurveto{\pgfqpoint{0.273cm}{1.432cm}}{\pgfqpoint{0.259cm}{1.467cm}}{\pgfqpoint{0.233cm}{1.492cm}}
\pgfpathcurveto{\pgfqpoint{0.207cm}{1.518cm}}{\pgfqpoint{0.173cm}{1.532cm}}{\pgfqpoint{0.137cm}{1.532cm}}
\pgfpathcurveto{\pgfqpoint{0.1cm}{1.532cm}}{\pgfqpoint{0.066cm}{1.518cm}}{\pgfqpoint{0.04cm}{1.492cm}}
\pgfpathcurveto{\pgfqpoint{0.014cm}{1.467cm}}{\pgfqpoint{0cm}{1.432cm}}{\pgfqpoint{0cm}{1.395cm}}
\pgfpathcurveto{\pgfqpoint{0cm}{1.359cm}}{\pgfqpoint{0.014cm}{1.324cm}}{\pgfqpoint{0.04cm}{1.299cm}}
\pgfpathcurveto{\pgfqpoint{0.066cm}{1.273cm}}{\pgfqpoint{0.1cm}{1.258cm}}{\pgfqpoint{0.137cm}{1.258cm}}
\pgfpathcurveto{\pgfqpoint{0.173cm}{1.258cm}}{\pgfqpoint{0.207cm}{1.273cm}}{\pgfqpoint{0.233cm}{1.299cm}}
\pgfpathcurveto{\pgfqpoint{0.259cm}{1.324cm}}{\pgfqpoint{0.273cm}{1.359cm}}{\pgfqpoint{0.273cm}{1.395cm}}
\pgfusepath{fill}
\begin{pgfscope}
\pgfsetdash{}{0cm}
\pgfsetlinewidth{0.818mm}
\pgfsetmiterlimit{7.0}
\pgfpathmoveto{\pgfqpoint{0.682cm}{0.671cm}}
\pgfpathlineto{\pgfqpoint{0.679cm}{1.418cm}}
\pgfusepath{stroke}
\end{pgfscope}
\pgfpathmoveto{\pgfqpoint{0.815cm}{1.399cm}}
\pgfpathcurveto{\pgfqpoint{0.815cm}{1.435cm}}{\pgfqpoint{0.801cm}{1.47cm}}{\pgfqpoint{0.775cm}{1.496cm}}
\pgfpathcurveto{\pgfqpoint{0.75cm}{1.521cm}}{\pgfqpoint{0.715cm}{1.536cm}}{\pgfqpoint{0.679cm}{1.536cm}}
\pgfpathcurveto{\pgfqpoint{0.643cm}{1.536cm}}{\pgfqpoint{0.608cm}{1.521cm}}{\pgfqpoint{0.582cm}{1.496cm}}
\pgfpathcurveto{\pgfqpoint{0.557cm}{1.47cm}}{\pgfqpoint{0.542cm}{1.435cm}}{\pgfqpoint{0.542cm}{1.399cm}}
\pgfpathcurveto{\pgfqpoint{0.542cm}{1.363cm}}{\pgfqpoint{0.557cm}{1.328cm}}{\pgfqpoint{0.582cm}{1.302cm}}
\pgfpathcurveto{\pgfqpoint{0.608cm}{1.276cm}}{\pgfqpoint{0.643cm}{1.262cm}}{\pgfqpoint{0.679cm}{1.262cm}}
\pgfpathcurveto{\pgfqpoint{0.715cm}{1.262cm}}{\pgfqpoint{0.75cm}{1.276cm}}{\pgfqpoint{0.775cm}{1.302cm}}
\pgfpathcurveto{\pgfqpoint{0.801cm}{1.328cm}}{\pgfqpoint{0.815cm}{1.363cm}}{\pgfqpoint{0.815cm}{1.399cm}}
\pgfusepath{fill}
\pgfpathmoveto{\pgfqpoint{1.345cm}{1.371cm}}
\pgfpathcurveto{\pgfqpoint{1.345cm}{1.408cm}}{\pgfqpoint{1.331cm}{1.442cm}}{\pgfqpoint{1.305cm}{1.468cm}}
\pgfpathcurveto{\pgfqpoint{1.28cm}{1.494cm}}{\pgfqpoint{1.245cm}{1.508cm}}{\pgfqpoint{1.209cm}{1.508cm}}
\pgfpathcurveto{\pgfqpoint{1.172cm}{1.508cm}}{\pgfqpoint{1.138cm}{1.494cm}}{\pgfqpoint{1.112cm}{1.468cm}}
\pgfpathcurveto{\pgfqpoint{1.087cm}{1.442cm}}{\pgfqpoint{1.072cm}{1.408cm}}{\pgfqpoint{1.072cm}{1.371cm}}
\pgfpathcurveto{\pgfqpoint{1.072cm}{1.335cm}}{\pgfqpoint{1.087cm}{1.3cm}}{\pgfqpoint{1.112cm}{1.274cm}}
\pgfpathcurveto{\pgfqpoint{1.138cm}{1.249cm}}{\pgfqpoint{1.172cm}{1.234cm}}{\pgfqpoint{1.209cm}{1.234cm}}
\pgfpathcurveto{\pgfqpoint{1.245cm}{1.234cm}}{\pgfqpoint{1.28cm}{1.249cm}}{\pgfqpoint{1.305cm}{1.274cm}}
\pgfpathcurveto{\pgfqpoint{1.331cm}{1.3cm}}{\pgfqpoint{1.345cm}{1.335cm}}{\pgfqpoint{1.345cm}{1.371cm}}
\pgfusepath{fill}
\begin{pgfscope}
\pgfsetdash{}{0cm}
\pgfsetlinewidth{0.818mm}
\pgfsetroundcap
\pgfsetmiterlimit{4.0}
\pgfpathmoveto{\pgfqpoint{0.682cm}{0.671cm}}
\pgfpathlineto{\pgfqpoint{0.682cm}{0.042cm}}
\pgfusepath{stroke}
\end{pgfscope}
\end{pgfscope}
\end{pgfscope}
\end{pgfscope}
\end{tikzpicture}}}+\phi+\psi)\prec X^{\!\resizebox{0.6em}{!}{
\begin{tikzpicture}
\pgfpathmoveto{\pgfqpoint{0cm}{0cm}}
\pgfpathlineto{\pgfqpoint{1.376cm}{0cm}}
\pgfpathlineto{\pgfqpoint{1.376cm}{1.588cm}}
\pgfpathlineto{\pgfqpoint{0cm}{1.588cm}}
\pgfpathclose
\pgfusepath{clip}
\begin{pgfscope}
\begin{pgfscope}
\pgfpathmoveto{\pgfqpoint{0cm}{0cm}}
\pgfpathlineto{\pgfqpoint{1.376cm}{0cm}}
\pgfpathlineto{\pgfqpoint{1.376cm}{1.588cm}}
\pgfpathlineto{\pgfqpoint{0cm}{1.588cm}}
\pgfpathclose
\pgfusepath{clip}
\begin{pgfscope}
\begin{pgfscope}
\definecolor{eps2pgf_color}{gray}{0.976471}\pgfsetstrokecolor{eps2pgf_color}\pgfsetfillcolor{eps2pgf_color}
\pgfpathmoveto{\pgfqpoint{0cm}{0cm}}
\pgfpathlineto{\pgfqpoint{1.376cm}{0cm}}
\pgfpathlineto{\pgfqpoint{1.376cm}{1.588cm}}
\pgfpathlineto{\pgfqpoint{0cm}{1.588cm}}
\pgfpathclose
\pgfusepath{fill}
\end{pgfscope}
\begin{pgfscope}
\pgfsetdash{}{0cm}
\pgfsetlinewidth{0.818mm}
\pgfsetroundcap
\pgfsetroundjoin
\pgfsetmiterlimit{7.0}
\definecolor{eps2pgf_color}{gray}{0}\pgfsetstrokecolor{eps2pgf_color}\pgfsetfillcolor{eps2pgf_color}
\pgfpathmoveto{\pgfqpoint{0.117cm}{1.476cm}}
\pgfpathlineto{\pgfqpoint{0.682cm}{0.726cm}}
\pgfpathlineto{\pgfqpoint{1.246cm}{1.476cm}}
\pgfusepath{stroke}
\end{pgfscope}
\definecolor{eps2pgf_color}{gray}{0}\pgfsetstrokecolor{eps2pgf_color}\pgfsetfillcolor{eps2pgf_color}
\pgfpathmoveto{\pgfqpoint{0.273cm}{1.451cm}}
\pgfpathcurveto{\pgfqpoint{0.273cm}{1.487cm}}{\pgfqpoint{0.259cm}{1.522cm}}{\pgfqpoint{0.233cm}{1.547cm}}
\pgfpathcurveto{\pgfqpoint{0.207cm}{1.573cm}}{\pgfqpoint{0.173cm}{1.588cm}}{\pgfqpoint{0.137cm}{1.588cm}}
\pgfpathcurveto{\pgfqpoint{0.1cm}{1.588cm}}{\pgfqpoint{0.066cm}{1.573cm}}{\pgfqpoint{0.04cm}{1.547cm}}
\pgfpathcurveto{\pgfqpoint{0.014cm}{1.522cm}}{\pgfqpoint{0cm}{1.487cm}}{\pgfqpoint{0cm}{1.451cm}}
\pgfpathcurveto{\pgfqpoint{0cm}{1.414cm}}{\pgfqpoint{0.014cm}{1.379cm}}{\pgfqpoint{0.04cm}{1.354cm}}
\pgfpathcurveto{\pgfqpoint{0.066cm}{1.328cm}}{\pgfqpoint{0.1cm}{1.314cm}}{\pgfqpoint{0.137cm}{1.314cm}}
\pgfpathcurveto{\pgfqpoint{0.173cm}{1.314cm}}{\pgfqpoint{0.207cm}{1.328cm}}{\pgfqpoint{0.233cm}{1.354cm}}
\pgfpathcurveto{\pgfqpoint{0.259cm}{1.379cm}}{\pgfqpoint{0.273cm}{1.414cm}}{\pgfqpoint{0.273cm}{1.451cm}}
\pgfusepath{fill}
\pgfpathmoveto{\pgfqpoint{1.345cm}{1.426cm}}
\pgfpathcurveto{\pgfqpoint{1.345cm}{1.463cm}}{\pgfqpoint{1.331cm}{1.497cm}}{\pgfqpoint{1.305cm}{1.523cm}}
\pgfpathcurveto{\pgfqpoint{1.28cm}{1.549cm}}{\pgfqpoint{1.245cm}{1.563cm}}{\pgfqpoint{1.209cm}{1.563cm}}
\pgfpathcurveto{\pgfqpoint{1.172cm}{1.563cm}}{\pgfqpoint{1.138cm}{1.549cm}}{\pgfqpoint{1.112cm}{1.523cm}}
\pgfpathcurveto{\pgfqpoint{1.087cm}{1.497cm}}{\pgfqpoint{1.072cm}{1.463cm}}{\pgfqpoint{1.072cm}{1.426cm}}
\pgfpathcurveto{\pgfqpoint{1.072cm}{1.39cm}}{\pgfqpoint{1.087cm}{1.355cm}}{\pgfqpoint{1.112cm}{1.329cm}}
\pgfpathcurveto{\pgfqpoint{1.138cm}{1.304cm}}{\pgfqpoint{1.172cm}{1.289cm}}{\pgfqpoint{1.209cm}{1.289cm}}
\pgfpathcurveto{\pgfqpoint{1.245cm}{1.289cm}}{\pgfqpoint{1.28cm}{1.304cm}}{\pgfqpoint{1.305cm}{1.329cm}}
\pgfpathcurveto{\pgfqpoint{1.331cm}{1.355cm}}{\pgfqpoint{1.345cm}{1.39cm}}{\pgfqpoint{1.345cm}{1.426cm}}
\pgfusepath{fill}
\begin{pgfscope}
\pgfsetdash{}{0cm}
\pgfsetlinewidth{0.818mm}
\pgfsetroundcap
\pgfsetmiterlimit{4.0}
\pgfpathmoveto{\pgfqpoint{0.682cm}{0.726cm}}
\pgfpathlineto{\pgfqpoint{0.682cm}{0.097cm}}
\pgfusepath{stroke}
\end{pgfscope}
\end{pgfscope}
\end{pgfscope}
\end{pgfscope}
\end{tikzpicture}}}+6(\nabla(-X^{\!\resizebox{0.6em}{!}{
\begin{tikzpicture}
\pgfpathmoveto{\pgfqpoint{0cm}{-0.035cm}}
\pgfpathlineto{\pgfqpoint{1.376cm}{-0.035cm}}
\pgfpathlineto{\pgfqpoint{1.376cm}{1.552cm}}
\pgfpathlineto{\pgfqpoint{0cm}{1.552cm}}
\pgfpathclose
\pgfusepath{clip}
\begin{pgfscope}
\begin{pgfscope}
\pgfpathmoveto{\pgfqpoint{0cm}{-0.035cm}}
\pgfpathlineto{\pgfqpoint{1.376cm}{-0.035cm}}
\pgfpathlineto{\pgfqpoint{1.376cm}{1.552cm}}
\pgfpathlineto{\pgfqpoint{0cm}{1.552cm}}
\pgfpathclose
\pgfusepath{clip}
\begin{pgfscope}
\begin{pgfscope}
\pgfsetdash{}{0cm}
\pgfsetlinewidth{0.818mm}
\pgfsetroundcap
\pgfsetroundjoin
\pgfsetmiterlimit{7.0}
\definecolor{eps2pgf_color}{gray}{0}\pgfsetstrokecolor{eps2pgf_color}\pgfsetfillcolor{eps2pgf_color}
\pgfpathmoveto{\pgfqpoint{0.117cm}{1.421cm}}
\pgfpathlineto{\pgfqpoint{0.682cm}{0.671cm}}
\pgfpathlineto{\pgfqpoint{1.246cm}{1.421cm}}
\pgfusepath{stroke}
\end{pgfscope}
\definecolor{eps2pgf_color}{gray}{0}\pgfsetstrokecolor{eps2pgf_color}\pgfsetfillcolor{eps2pgf_color}
\pgfpathmoveto{\pgfqpoint{0.273cm}{1.395cm}}
\pgfpathcurveto{\pgfqpoint{0.273cm}{1.432cm}}{\pgfqpoint{0.259cm}{1.467cm}}{\pgfqpoint{0.233cm}{1.492cm}}
\pgfpathcurveto{\pgfqpoint{0.207cm}{1.518cm}}{\pgfqpoint{0.173cm}{1.532cm}}{\pgfqpoint{0.137cm}{1.532cm}}
\pgfpathcurveto{\pgfqpoint{0.1cm}{1.532cm}}{\pgfqpoint{0.066cm}{1.518cm}}{\pgfqpoint{0.04cm}{1.492cm}}
\pgfpathcurveto{\pgfqpoint{0.014cm}{1.467cm}}{\pgfqpoint{0cm}{1.432cm}}{\pgfqpoint{0cm}{1.395cm}}
\pgfpathcurveto{\pgfqpoint{0cm}{1.359cm}}{\pgfqpoint{0.014cm}{1.324cm}}{\pgfqpoint{0.04cm}{1.299cm}}
\pgfpathcurveto{\pgfqpoint{0.066cm}{1.273cm}}{\pgfqpoint{0.1cm}{1.258cm}}{\pgfqpoint{0.137cm}{1.258cm}}
\pgfpathcurveto{\pgfqpoint{0.173cm}{1.258cm}}{\pgfqpoint{0.207cm}{1.273cm}}{\pgfqpoint{0.233cm}{1.299cm}}
\pgfpathcurveto{\pgfqpoint{0.259cm}{1.324cm}}{\pgfqpoint{0.273cm}{1.359cm}}{\pgfqpoint{0.273cm}{1.395cm}}
\pgfusepath{fill}
\begin{pgfscope}
\pgfsetdash{}{0cm}
\pgfsetlinewidth{0.818mm}
\pgfsetmiterlimit{7.0}
\pgfpathmoveto{\pgfqpoint{0.682cm}{0.671cm}}
\pgfpathlineto{\pgfqpoint{0.679cm}{1.418cm}}
\pgfusepath{stroke}
\end{pgfscope}
\pgfpathmoveto{\pgfqpoint{0.815cm}{1.399cm}}
\pgfpathcurveto{\pgfqpoint{0.815cm}{1.435cm}}{\pgfqpoint{0.801cm}{1.47cm}}{\pgfqpoint{0.775cm}{1.496cm}}
\pgfpathcurveto{\pgfqpoint{0.75cm}{1.521cm}}{\pgfqpoint{0.715cm}{1.536cm}}{\pgfqpoint{0.679cm}{1.536cm}}
\pgfpathcurveto{\pgfqpoint{0.643cm}{1.536cm}}{\pgfqpoint{0.608cm}{1.521cm}}{\pgfqpoint{0.582cm}{1.496cm}}
\pgfpathcurveto{\pgfqpoint{0.557cm}{1.47cm}}{\pgfqpoint{0.542cm}{1.435cm}}{\pgfqpoint{0.542cm}{1.399cm}}
\pgfpathcurveto{\pgfqpoint{0.542cm}{1.363cm}}{\pgfqpoint{0.557cm}{1.328cm}}{\pgfqpoint{0.582cm}{1.302cm}}
\pgfpathcurveto{\pgfqpoint{0.608cm}{1.276cm}}{\pgfqpoint{0.643cm}{1.262cm}}{\pgfqpoint{0.679cm}{1.262cm}}
\pgfpathcurveto{\pgfqpoint{0.715cm}{1.262cm}}{\pgfqpoint{0.75cm}{1.276cm}}{\pgfqpoint{0.775cm}{1.302cm}}
\pgfpathcurveto{\pgfqpoint{0.801cm}{1.328cm}}{\pgfqpoint{0.815cm}{1.363cm}}{\pgfqpoint{0.815cm}{1.399cm}}
\pgfusepath{fill}
\pgfpathmoveto{\pgfqpoint{1.345cm}{1.371cm}}
\pgfpathcurveto{\pgfqpoint{1.345cm}{1.408cm}}{\pgfqpoint{1.331cm}{1.442cm}}{\pgfqpoint{1.305cm}{1.468cm}}
\pgfpathcurveto{\pgfqpoint{1.28cm}{1.494cm}}{\pgfqpoint{1.245cm}{1.508cm}}{\pgfqpoint{1.209cm}{1.508cm}}
\pgfpathcurveto{\pgfqpoint{1.172cm}{1.508cm}}{\pgfqpoint{1.138cm}{1.494cm}}{\pgfqpoint{1.112cm}{1.468cm}}
\pgfpathcurveto{\pgfqpoint{1.087cm}{1.442cm}}{\pgfqpoint{1.072cm}{1.408cm}}{\pgfqpoint{1.072cm}{1.371cm}}
\pgfpathcurveto{\pgfqpoint{1.072cm}{1.335cm}}{\pgfqpoint{1.087cm}{1.3cm}}{\pgfqpoint{1.112cm}{1.274cm}}
\pgfpathcurveto{\pgfqpoint{1.138cm}{1.249cm}}{\pgfqpoint{1.172cm}{1.234cm}}{\pgfqpoint{1.209cm}{1.234cm}}
\pgfpathcurveto{\pgfqpoint{1.245cm}{1.234cm}}{\pgfqpoint{1.28cm}{1.249cm}}{\pgfqpoint{1.305cm}{1.274cm}}
\pgfpathcurveto{\pgfqpoint{1.331cm}{1.3cm}}{\pgfqpoint{1.345cm}{1.335cm}}{\pgfqpoint{1.345cm}{1.371cm}}
\pgfusepath{fill}
\begin{pgfscope}
\pgfsetdash{}{0cm}
\pgfsetlinewidth{0.818mm}
\pgfsetroundcap
\pgfsetmiterlimit{4.0}
\pgfpathmoveto{\pgfqpoint{0.682cm}{0.671cm}}
\pgfpathlineto{\pgfqpoint{0.682cm}{0.042cm}}
\pgfusepath{stroke}
\end{pgfscope}
\end{pgfscope}
\end{pgfscope}
\end{pgfscope}
\end{tikzpicture}}}+\phi+\psi))\prec(\nabla X^{\!\resizebox{0.6em}{!}{
\begin{tikzpicture}
\pgfpathmoveto{\pgfqpoint{0cm}{0cm}}
\pgfpathlineto{\pgfqpoint{1.376cm}{0cm}}
\pgfpathlineto{\pgfqpoint{1.376cm}{1.588cm}}
\pgfpathlineto{\pgfqpoint{0cm}{1.588cm}}
\pgfpathclose
\pgfusepath{clip}
\begin{pgfscope}
\begin{pgfscope}
\pgfpathmoveto{\pgfqpoint{0cm}{0cm}}
\pgfpathlineto{\pgfqpoint{1.376cm}{0cm}}
\pgfpathlineto{\pgfqpoint{1.376cm}{1.588cm}}
\pgfpathlineto{\pgfqpoint{0cm}{1.588cm}}
\pgfpathclose
\pgfusepath{clip}
\begin{pgfscope}
\begin{pgfscope}
\definecolor{eps2pgf_color}{gray}{0.976471}\pgfsetstrokecolor{eps2pgf_color}\pgfsetfillcolor{eps2pgf_color}
\pgfpathmoveto{\pgfqpoint{0cm}{0cm}}
\pgfpathlineto{\pgfqpoint{1.376cm}{0cm}}
\pgfpathlineto{\pgfqpoint{1.376cm}{1.588cm}}
\pgfpathlineto{\pgfqpoint{0cm}{1.588cm}}
\pgfpathclose
\pgfusepath{fill}
\end{pgfscope}
\begin{pgfscope}
\pgfsetdash{}{0cm}
\pgfsetlinewidth{0.818mm}
\pgfsetroundcap
\pgfsetroundjoin
\pgfsetmiterlimit{7.0}
\definecolor{eps2pgf_color}{gray}{0}\pgfsetstrokecolor{eps2pgf_color}\pgfsetfillcolor{eps2pgf_color}
\pgfpathmoveto{\pgfqpoint{0.117cm}{1.476cm}}
\pgfpathlineto{\pgfqpoint{0.682cm}{0.726cm}}
\pgfpathlineto{\pgfqpoint{1.246cm}{1.476cm}}
\pgfusepath{stroke}
\end{pgfscope}
\definecolor{eps2pgf_color}{gray}{0}\pgfsetstrokecolor{eps2pgf_color}\pgfsetfillcolor{eps2pgf_color}
\pgfpathmoveto{\pgfqpoint{0.273cm}{1.451cm}}
\pgfpathcurveto{\pgfqpoint{0.273cm}{1.487cm}}{\pgfqpoint{0.259cm}{1.522cm}}{\pgfqpoint{0.233cm}{1.547cm}}
\pgfpathcurveto{\pgfqpoint{0.207cm}{1.573cm}}{\pgfqpoint{0.173cm}{1.588cm}}{\pgfqpoint{0.137cm}{1.588cm}}
\pgfpathcurveto{\pgfqpoint{0.1cm}{1.588cm}}{\pgfqpoint{0.066cm}{1.573cm}}{\pgfqpoint{0.04cm}{1.547cm}}
\pgfpathcurveto{\pgfqpoint{0.014cm}{1.522cm}}{\pgfqpoint{0cm}{1.487cm}}{\pgfqpoint{0cm}{1.451cm}}
\pgfpathcurveto{\pgfqpoint{0cm}{1.414cm}}{\pgfqpoint{0.014cm}{1.379cm}}{\pgfqpoint{0.04cm}{1.354cm}}
\pgfpathcurveto{\pgfqpoint{0.066cm}{1.328cm}}{\pgfqpoint{0.1cm}{1.314cm}}{\pgfqpoint{0.137cm}{1.314cm}}
\pgfpathcurveto{\pgfqpoint{0.173cm}{1.314cm}}{\pgfqpoint{0.207cm}{1.328cm}}{\pgfqpoint{0.233cm}{1.354cm}}
\pgfpathcurveto{\pgfqpoint{0.259cm}{1.379cm}}{\pgfqpoint{0.273cm}{1.414cm}}{\pgfqpoint{0.273cm}{1.451cm}}
\pgfusepath{fill}
\pgfpathmoveto{\pgfqpoint{1.345cm}{1.426cm}}
\pgfpathcurveto{\pgfqpoint{1.345cm}{1.463cm}}{\pgfqpoint{1.331cm}{1.497cm}}{\pgfqpoint{1.305cm}{1.523cm}}
\pgfpathcurveto{\pgfqpoint{1.28cm}{1.549cm}}{\pgfqpoint{1.245cm}{1.563cm}}{\pgfqpoint{1.209cm}{1.563cm}}
\pgfpathcurveto{\pgfqpoint{1.172cm}{1.563cm}}{\pgfqpoint{1.138cm}{1.549cm}}{\pgfqpoint{1.112cm}{1.523cm}}
\pgfpathcurveto{\pgfqpoint{1.087cm}{1.497cm}}{\pgfqpoint{1.072cm}{1.463cm}}{\pgfqpoint{1.072cm}{1.426cm}}
\pgfpathcurveto{\pgfqpoint{1.072cm}{1.39cm}}{\pgfqpoint{1.087cm}{1.355cm}}{\pgfqpoint{1.112cm}{1.329cm}}
\pgfpathcurveto{\pgfqpoint{1.138cm}{1.304cm}}{\pgfqpoint{1.172cm}{1.289cm}}{\pgfqpoint{1.209cm}{1.289cm}}
\pgfpathcurveto{\pgfqpoint{1.245cm}{1.289cm}}{\pgfqpoint{1.28cm}{1.304cm}}{\pgfqpoint{1.305cm}{1.329cm}}
\pgfpathcurveto{\pgfqpoint{1.331cm}{1.355cm}}{\pgfqpoint{1.345cm}{1.39cm}}{\pgfqpoint{1.345cm}{1.426cm}}
\pgfusepath{fill}
\begin{pgfscope}
\pgfsetdash{}{0cm}
\pgfsetlinewidth{0.818mm}
\pgfsetroundcap
\pgfsetmiterlimit{4.0}
\pgfpathmoveto{\pgfqpoint{0.682cm}{0.726cm}}
\pgfpathlineto{\pgfqpoint{0.682cm}{0.097cm}}
\pgfusepath{stroke}
\end{pgfscope}
\end{pgfscope}
\end{pgfscope}
\end{pgfscope}
\end{tikzpicture}}})\|_{\CC^{-1+\alpha}(\rho^{2+\alpha})}\\
&\quad\lesssim \|-X^{\!\resizebox{0.6em}{!}{
\begin{tikzpicture}
\pgfpathmoveto{\pgfqpoint{0cm}{-0.035cm}}
\pgfpathlineto{\pgfqpoint{1.376cm}{-0.035cm}}
\pgfpathlineto{\pgfqpoint{1.376cm}{1.552cm}}
\pgfpathlineto{\pgfqpoint{0cm}{1.552cm}}
\pgfpathclose
\pgfusepath{clip}
\begin{pgfscope}
\begin{pgfscope}
\pgfpathmoveto{\pgfqpoint{0cm}{-0.035cm}}
\pgfpathlineto{\pgfqpoint{1.376cm}{-0.035cm}}
\pgfpathlineto{\pgfqpoint{1.376cm}{1.552cm}}
\pgfpathlineto{\pgfqpoint{0cm}{1.552cm}}
\pgfpathclose
\pgfusepath{clip}
\begin{pgfscope}
\begin{pgfscope}
\pgfsetdash{}{0cm}
\pgfsetlinewidth{0.818mm}
\pgfsetroundcap
\pgfsetroundjoin
\pgfsetmiterlimit{7.0}
\definecolor{eps2pgf_color}{gray}{0}\pgfsetstrokecolor{eps2pgf_color}\pgfsetfillcolor{eps2pgf_color}
\pgfpathmoveto{\pgfqpoint{0.117cm}{1.421cm}}
\pgfpathlineto{\pgfqpoint{0.682cm}{0.671cm}}
\pgfpathlineto{\pgfqpoint{1.246cm}{1.421cm}}
\pgfusepath{stroke}
\end{pgfscope}
\definecolor{eps2pgf_color}{gray}{0}\pgfsetstrokecolor{eps2pgf_color}\pgfsetfillcolor{eps2pgf_color}
\pgfpathmoveto{\pgfqpoint{0.273cm}{1.395cm}}
\pgfpathcurveto{\pgfqpoint{0.273cm}{1.432cm}}{\pgfqpoint{0.259cm}{1.467cm}}{\pgfqpoint{0.233cm}{1.492cm}}
\pgfpathcurveto{\pgfqpoint{0.207cm}{1.518cm}}{\pgfqpoint{0.173cm}{1.532cm}}{\pgfqpoint{0.137cm}{1.532cm}}
\pgfpathcurveto{\pgfqpoint{0.1cm}{1.532cm}}{\pgfqpoint{0.066cm}{1.518cm}}{\pgfqpoint{0.04cm}{1.492cm}}
\pgfpathcurveto{\pgfqpoint{0.014cm}{1.467cm}}{\pgfqpoint{0cm}{1.432cm}}{\pgfqpoint{0cm}{1.395cm}}
\pgfpathcurveto{\pgfqpoint{0cm}{1.359cm}}{\pgfqpoint{0.014cm}{1.324cm}}{\pgfqpoint{0.04cm}{1.299cm}}
\pgfpathcurveto{\pgfqpoint{0.066cm}{1.273cm}}{\pgfqpoint{0.1cm}{1.258cm}}{\pgfqpoint{0.137cm}{1.258cm}}
\pgfpathcurveto{\pgfqpoint{0.173cm}{1.258cm}}{\pgfqpoint{0.207cm}{1.273cm}}{\pgfqpoint{0.233cm}{1.299cm}}
\pgfpathcurveto{\pgfqpoint{0.259cm}{1.324cm}}{\pgfqpoint{0.273cm}{1.359cm}}{\pgfqpoint{0.273cm}{1.395cm}}
\pgfusepath{fill}
\begin{pgfscope}
\pgfsetdash{}{0cm}
\pgfsetlinewidth{0.818mm}
\pgfsetmiterlimit{7.0}
\pgfpathmoveto{\pgfqpoint{0.682cm}{0.671cm}}
\pgfpathlineto{\pgfqpoint{0.679cm}{1.418cm}}
\pgfusepath{stroke}
\end{pgfscope}
\pgfpathmoveto{\pgfqpoint{0.815cm}{1.399cm}}
\pgfpathcurveto{\pgfqpoint{0.815cm}{1.435cm}}{\pgfqpoint{0.801cm}{1.47cm}}{\pgfqpoint{0.775cm}{1.496cm}}
\pgfpathcurveto{\pgfqpoint{0.75cm}{1.521cm}}{\pgfqpoint{0.715cm}{1.536cm}}{\pgfqpoint{0.679cm}{1.536cm}}
\pgfpathcurveto{\pgfqpoint{0.643cm}{1.536cm}}{\pgfqpoint{0.608cm}{1.521cm}}{\pgfqpoint{0.582cm}{1.496cm}}
\pgfpathcurveto{\pgfqpoint{0.557cm}{1.47cm}}{\pgfqpoint{0.542cm}{1.435cm}}{\pgfqpoint{0.542cm}{1.399cm}}
\pgfpathcurveto{\pgfqpoint{0.542cm}{1.363cm}}{\pgfqpoint{0.557cm}{1.328cm}}{\pgfqpoint{0.582cm}{1.302cm}}
\pgfpathcurveto{\pgfqpoint{0.608cm}{1.276cm}}{\pgfqpoint{0.643cm}{1.262cm}}{\pgfqpoint{0.679cm}{1.262cm}}
\pgfpathcurveto{\pgfqpoint{0.715cm}{1.262cm}}{\pgfqpoint{0.75cm}{1.276cm}}{\pgfqpoint{0.775cm}{1.302cm}}
\pgfpathcurveto{\pgfqpoint{0.801cm}{1.328cm}}{\pgfqpoint{0.815cm}{1.363cm}}{\pgfqpoint{0.815cm}{1.399cm}}
\pgfusepath{fill}
\pgfpathmoveto{\pgfqpoint{1.345cm}{1.371cm}}
\pgfpathcurveto{\pgfqpoint{1.345cm}{1.408cm}}{\pgfqpoint{1.331cm}{1.442cm}}{\pgfqpoint{1.305cm}{1.468cm}}
\pgfpathcurveto{\pgfqpoint{1.28cm}{1.494cm}}{\pgfqpoint{1.245cm}{1.508cm}}{\pgfqpoint{1.209cm}{1.508cm}}
\pgfpathcurveto{\pgfqpoint{1.172cm}{1.508cm}}{\pgfqpoint{1.138cm}{1.494cm}}{\pgfqpoint{1.112cm}{1.468cm}}
\pgfpathcurveto{\pgfqpoint{1.087cm}{1.442cm}}{\pgfqpoint{1.072cm}{1.408cm}}{\pgfqpoint{1.072cm}{1.371cm}}
\pgfpathcurveto{\pgfqpoint{1.072cm}{1.335cm}}{\pgfqpoint{1.087cm}{1.3cm}}{\pgfqpoint{1.112cm}{1.274cm}}
\pgfpathcurveto{\pgfqpoint{1.138cm}{1.249cm}}{\pgfqpoint{1.172cm}{1.234cm}}{\pgfqpoint{1.209cm}{1.234cm}}
\pgfpathcurveto{\pgfqpoint{1.245cm}{1.234cm}}{\pgfqpoint{1.28cm}{1.249cm}}{\pgfqpoint{1.305cm}{1.274cm}}
\pgfpathcurveto{\pgfqpoint{1.331cm}{1.3cm}}{\pgfqpoint{1.345cm}{1.335cm}}{\pgfqpoint{1.345cm}{1.371cm}}
\pgfusepath{fill}
\begin{pgfscope}
\pgfsetdash{}{0cm}
\pgfsetlinewidth{0.818mm}
\pgfsetroundcap
\pgfsetmiterlimit{4.0}
\pgfpathmoveto{\pgfqpoint{0.682cm}{0.671cm}}
\pgfpathlineto{\pgfqpoint{0.682cm}{0.042cm}}
\pgfusepath{stroke}
\end{pgfscope}
\end{pgfscope}
\end{pgfscope}
\end{pgfscope}
\end{tikzpicture}}}+\phi+\psi\|_{\CC^{\frac{1}{2}-\kappa}(\rho^{\frac{3}{2}+\alpha})}\| X^{\!\resizebox{0.6em}{!}{
\begin{tikzpicture}
\pgfpathmoveto{\pgfqpoint{0cm}{0cm}}
\pgfpathlineto{\pgfqpoint{1.376cm}{0cm}}
\pgfpathlineto{\pgfqpoint{1.376cm}{1.588cm}}
\pgfpathlineto{\pgfqpoint{0cm}{1.588cm}}
\pgfpathclose
\pgfusepath{clip}
\begin{pgfscope}
\begin{pgfscope}
\pgfpathmoveto{\pgfqpoint{0cm}{0cm}}
\pgfpathlineto{\pgfqpoint{1.376cm}{0cm}}
\pgfpathlineto{\pgfqpoint{1.376cm}{1.588cm}}
\pgfpathlineto{\pgfqpoint{0cm}{1.588cm}}
\pgfpathclose
\pgfusepath{clip}
\begin{pgfscope}
\begin{pgfscope}
\definecolor{eps2pgf_color}{gray}{0.976471}\pgfsetstrokecolor{eps2pgf_color}\pgfsetfillcolor{eps2pgf_color}
\pgfpathmoveto{\pgfqpoint{0cm}{0cm}}
\pgfpathlineto{\pgfqpoint{1.376cm}{0cm}}
\pgfpathlineto{\pgfqpoint{1.376cm}{1.588cm}}
\pgfpathlineto{\pgfqpoint{0cm}{1.588cm}}
\pgfpathclose
\pgfusepath{fill}
\end{pgfscope}
\begin{pgfscope}
\pgfsetdash{}{0cm}
\pgfsetlinewidth{0.818mm}
\pgfsetroundcap
\pgfsetroundjoin
\pgfsetmiterlimit{7.0}
\definecolor{eps2pgf_color}{gray}{0}\pgfsetstrokecolor{eps2pgf_color}\pgfsetfillcolor{eps2pgf_color}
\pgfpathmoveto{\pgfqpoint{0.117cm}{1.476cm}}
\pgfpathlineto{\pgfqpoint{0.682cm}{0.726cm}}
\pgfpathlineto{\pgfqpoint{1.246cm}{1.476cm}}
\pgfusepath{stroke}
\end{pgfscope}
\definecolor{eps2pgf_color}{gray}{0}\pgfsetstrokecolor{eps2pgf_color}\pgfsetfillcolor{eps2pgf_color}
\pgfpathmoveto{\pgfqpoint{0.273cm}{1.451cm}}
\pgfpathcurveto{\pgfqpoint{0.273cm}{1.487cm}}{\pgfqpoint{0.259cm}{1.522cm}}{\pgfqpoint{0.233cm}{1.547cm}}
\pgfpathcurveto{\pgfqpoint{0.207cm}{1.573cm}}{\pgfqpoint{0.173cm}{1.588cm}}{\pgfqpoint{0.137cm}{1.588cm}}
\pgfpathcurveto{\pgfqpoint{0.1cm}{1.588cm}}{\pgfqpoint{0.066cm}{1.573cm}}{\pgfqpoint{0.04cm}{1.547cm}}
\pgfpathcurveto{\pgfqpoint{0.014cm}{1.522cm}}{\pgfqpoint{0cm}{1.487cm}}{\pgfqpoint{0cm}{1.451cm}}
\pgfpathcurveto{\pgfqpoint{0cm}{1.414cm}}{\pgfqpoint{0.014cm}{1.379cm}}{\pgfqpoint{0.04cm}{1.354cm}}
\pgfpathcurveto{\pgfqpoint{0.066cm}{1.328cm}}{\pgfqpoint{0.1cm}{1.314cm}}{\pgfqpoint{0.137cm}{1.314cm}}
\pgfpathcurveto{\pgfqpoint{0.173cm}{1.314cm}}{\pgfqpoint{0.207cm}{1.328cm}}{\pgfqpoint{0.233cm}{1.354cm}}
\pgfpathcurveto{\pgfqpoint{0.259cm}{1.379cm}}{\pgfqpoint{0.273cm}{1.414cm}}{\pgfqpoint{0.273cm}{1.451cm}}
\pgfusepath{fill}
\pgfpathmoveto{\pgfqpoint{1.345cm}{1.426cm}}
\pgfpathcurveto{\pgfqpoint{1.345cm}{1.463cm}}{\pgfqpoint{1.331cm}{1.497cm}}{\pgfqpoint{1.305cm}{1.523cm}}
\pgfpathcurveto{\pgfqpoint{1.28cm}{1.549cm}}{\pgfqpoint{1.245cm}{1.563cm}}{\pgfqpoint{1.209cm}{1.563cm}}
\pgfpathcurveto{\pgfqpoint{1.172cm}{1.563cm}}{\pgfqpoint{1.138cm}{1.549cm}}{\pgfqpoint{1.112cm}{1.523cm}}
\pgfpathcurveto{\pgfqpoint{1.087cm}{1.497cm}}{\pgfqpoint{1.072cm}{1.463cm}}{\pgfqpoint{1.072cm}{1.426cm}}
\pgfpathcurveto{\pgfqpoint{1.072cm}{1.39cm}}{\pgfqpoint{1.087cm}{1.355cm}}{\pgfqpoint{1.112cm}{1.329cm}}
\pgfpathcurveto{\pgfqpoint{1.138cm}{1.304cm}}{\pgfqpoint{1.172cm}{1.289cm}}{\pgfqpoint{1.209cm}{1.289cm}}
\pgfpathcurveto{\pgfqpoint{1.245cm}{1.289cm}}{\pgfqpoint{1.28cm}{1.304cm}}{\pgfqpoint{1.305cm}{1.329cm}}
\pgfpathcurveto{\pgfqpoint{1.331cm}{1.355cm}}{\pgfqpoint{1.345cm}{1.39cm}}{\pgfqpoint{1.345cm}{1.426cm}}
\pgfusepath{fill}
\begin{pgfscope}
\pgfsetdash{}{0cm}
\pgfsetlinewidth{0.818mm}
\pgfsetroundcap
\pgfsetmiterlimit{4.0}
\pgfpathmoveto{\pgfqpoint{0.682cm}{0.726cm}}
\pgfpathlineto{\pgfqpoint{0.682cm}{0.097cm}}
\pgfusepath{stroke}
\end{pgfscope}
\end{pgfscope}
\end{pgfscope}
\end{pgfscope}
\end{tikzpicture}}}\|_{\CC^{1-\kappa}(\rho^\sigma)}\lesssim 1+\|\phi+\psi\|_{\CC^{\frac{1}{2}+\alpha}(\rho^{\frac{3}{2}+\alpha})}\\
&\quad \lesssim 1+\|\psi\|^\varepsilon_{L^\infty(\rho)}+\|\psi\|_{\CC^{\frac{1}{2}+\alpha}(\rho^{\frac{3}{2}+\alpha})}.
\end{align*}

To summarize, we have proved that
\begin{equation}\label{eq:12}
\|\vartheta\|_{\CC^{1+\alpha}(\rho^{2+\alpha})}\lesssim 1 +  \|\psi\|^{1+\varepsilon}_{L^\infty(\rho)}+\|\psi\|_{\CC^{\frac{1}{2}+\alpha}(\rho^{\frac{3}{2}+\alpha})}.
\end{equation}

\subsection{Bound for $\psi$ in $\CC^{2+\gamma}(\rho^{3+\gamma})$}
\label{ssec:psi1}

In this section we make use of the estimates \eqref{eq:10}, \eqref{eq:11}, \eqref{eq:12} in order to estimate  $\psi$ in $\CC^{2+\gamma}(\rho^{3+\gamma})$ for  $\gamma=\alpha-\kappa>0$ such that $\gamma\leq\frac{1}{2}-3\kappa$ (which can be achieved by a suitable choice of $\alpha, \kappa>0$. In view of \eqref{eq:two} and Lemma \ref{lemma:schauder-ellptic}, it is therefore necessary to estimate $\Psi$ in $\CC^{\gamma}(\rho^{3+\gamma})$.
We estimate as follows
\begin{align*}
\|3\llbracket X^2\rrbracket\circ\psi\|_{\CC^\gamma(\rho^{3+\gamma})}+\|3\llbracket X^2\rrbracket\circ\vartheta\|_{\CC^\gamma(\rho^{3+\gamma})}&\lesssim \|\psi\|_{\CC^{1+\alpha}(\rho^{2+\alpha})}+\|\vartheta\|_{\CC^{1+\alpha}(\rho^{2+\alpha})}\\
&\lesssim\|\psi\|_{\CC^{1+\alpha}(\rho^{2+\alpha})}+1+\|\psi\|^{1+\varepsilon}_{L^\infty(\rho)}+\|\psi\|_{\CC^{\frac{1}{2}+\alpha}(\rho^{\frac{3}{2}+\alpha})} ,
\end{align*}
and according to Lemma \ref{lem:com}
\begin{align*}
\|9\mathrm{com}(-X^{\!\resizebox{0.6em}{!}{
\begin{tikzpicture}
\pgfpathmoveto{\pgfqpoint{0cm}{-0.035cm}}
\pgfpathlineto{\pgfqpoint{1.376cm}{-0.035cm}}
\pgfpathlineto{\pgfqpoint{1.376cm}{1.552cm}}
\pgfpathlineto{\pgfqpoint{0cm}{1.552cm}}
\pgfpathclose
\pgfusepath{clip}
\begin{pgfscope}
\begin{pgfscope}
\pgfpathmoveto{\pgfqpoint{0cm}{-0.035cm}}
\pgfpathlineto{\pgfqpoint{1.376cm}{-0.035cm}}
\pgfpathlineto{\pgfqpoint{1.376cm}{1.552cm}}
\pgfpathlineto{\pgfqpoint{0cm}{1.552cm}}
\pgfpathclose
\pgfusepath{clip}
\begin{pgfscope}
\begin{pgfscope}
\pgfsetdash{}{0cm}
\pgfsetlinewidth{0.818mm}
\pgfsetroundcap
\pgfsetroundjoin
\pgfsetmiterlimit{7.0}
\definecolor{eps2pgf_color}{gray}{0}\pgfsetstrokecolor{eps2pgf_color}\pgfsetfillcolor{eps2pgf_color}
\pgfpathmoveto{\pgfqpoint{0.117cm}{1.421cm}}
\pgfpathlineto{\pgfqpoint{0.682cm}{0.671cm}}
\pgfpathlineto{\pgfqpoint{1.246cm}{1.421cm}}
\pgfusepath{stroke}
\end{pgfscope}
\definecolor{eps2pgf_color}{gray}{0}\pgfsetstrokecolor{eps2pgf_color}\pgfsetfillcolor{eps2pgf_color}
\pgfpathmoveto{\pgfqpoint{0.273cm}{1.395cm}}
\pgfpathcurveto{\pgfqpoint{0.273cm}{1.432cm}}{\pgfqpoint{0.259cm}{1.467cm}}{\pgfqpoint{0.233cm}{1.492cm}}
\pgfpathcurveto{\pgfqpoint{0.207cm}{1.518cm}}{\pgfqpoint{0.173cm}{1.532cm}}{\pgfqpoint{0.137cm}{1.532cm}}
\pgfpathcurveto{\pgfqpoint{0.1cm}{1.532cm}}{\pgfqpoint{0.066cm}{1.518cm}}{\pgfqpoint{0.04cm}{1.492cm}}
\pgfpathcurveto{\pgfqpoint{0.014cm}{1.467cm}}{\pgfqpoint{0cm}{1.432cm}}{\pgfqpoint{0cm}{1.395cm}}
\pgfpathcurveto{\pgfqpoint{0cm}{1.359cm}}{\pgfqpoint{0.014cm}{1.324cm}}{\pgfqpoint{0.04cm}{1.299cm}}
\pgfpathcurveto{\pgfqpoint{0.066cm}{1.273cm}}{\pgfqpoint{0.1cm}{1.258cm}}{\pgfqpoint{0.137cm}{1.258cm}}
\pgfpathcurveto{\pgfqpoint{0.173cm}{1.258cm}}{\pgfqpoint{0.207cm}{1.273cm}}{\pgfqpoint{0.233cm}{1.299cm}}
\pgfpathcurveto{\pgfqpoint{0.259cm}{1.324cm}}{\pgfqpoint{0.273cm}{1.359cm}}{\pgfqpoint{0.273cm}{1.395cm}}
\pgfusepath{fill}
\begin{pgfscope}
\pgfsetdash{}{0cm}
\pgfsetlinewidth{0.818mm}
\pgfsetmiterlimit{7.0}
\pgfpathmoveto{\pgfqpoint{0.682cm}{0.671cm}}
\pgfpathlineto{\pgfqpoint{0.679cm}{1.418cm}}
\pgfusepath{stroke}
\end{pgfscope}
\pgfpathmoveto{\pgfqpoint{0.815cm}{1.399cm}}
\pgfpathcurveto{\pgfqpoint{0.815cm}{1.435cm}}{\pgfqpoint{0.801cm}{1.47cm}}{\pgfqpoint{0.775cm}{1.496cm}}
\pgfpathcurveto{\pgfqpoint{0.75cm}{1.521cm}}{\pgfqpoint{0.715cm}{1.536cm}}{\pgfqpoint{0.679cm}{1.536cm}}
\pgfpathcurveto{\pgfqpoint{0.643cm}{1.536cm}}{\pgfqpoint{0.608cm}{1.521cm}}{\pgfqpoint{0.582cm}{1.496cm}}
\pgfpathcurveto{\pgfqpoint{0.557cm}{1.47cm}}{\pgfqpoint{0.542cm}{1.435cm}}{\pgfqpoint{0.542cm}{1.399cm}}
\pgfpathcurveto{\pgfqpoint{0.542cm}{1.363cm}}{\pgfqpoint{0.557cm}{1.328cm}}{\pgfqpoint{0.582cm}{1.302cm}}
\pgfpathcurveto{\pgfqpoint{0.608cm}{1.276cm}}{\pgfqpoint{0.643cm}{1.262cm}}{\pgfqpoint{0.679cm}{1.262cm}}
\pgfpathcurveto{\pgfqpoint{0.715cm}{1.262cm}}{\pgfqpoint{0.75cm}{1.276cm}}{\pgfqpoint{0.775cm}{1.302cm}}
\pgfpathcurveto{\pgfqpoint{0.801cm}{1.328cm}}{\pgfqpoint{0.815cm}{1.363cm}}{\pgfqpoint{0.815cm}{1.399cm}}
\pgfusepath{fill}
\pgfpathmoveto{\pgfqpoint{1.345cm}{1.371cm}}
\pgfpathcurveto{\pgfqpoint{1.345cm}{1.408cm}}{\pgfqpoint{1.331cm}{1.442cm}}{\pgfqpoint{1.305cm}{1.468cm}}
\pgfpathcurveto{\pgfqpoint{1.28cm}{1.494cm}}{\pgfqpoint{1.245cm}{1.508cm}}{\pgfqpoint{1.209cm}{1.508cm}}
\pgfpathcurveto{\pgfqpoint{1.172cm}{1.508cm}}{\pgfqpoint{1.138cm}{1.494cm}}{\pgfqpoint{1.112cm}{1.468cm}}
\pgfpathcurveto{\pgfqpoint{1.087cm}{1.442cm}}{\pgfqpoint{1.072cm}{1.408cm}}{\pgfqpoint{1.072cm}{1.371cm}}
\pgfpathcurveto{\pgfqpoint{1.072cm}{1.335cm}}{\pgfqpoint{1.087cm}{1.3cm}}{\pgfqpoint{1.112cm}{1.274cm}}
\pgfpathcurveto{\pgfqpoint{1.138cm}{1.249cm}}{\pgfqpoint{1.172cm}{1.234cm}}{\pgfqpoint{1.209cm}{1.234cm}}
\pgfpathcurveto{\pgfqpoint{1.245cm}{1.234cm}}{\pgfqpoint{1.28cm}{1.249cm}}{\pgfqpoint{1.305cm}{1.274cm}}
\pgfpathcurveto{\pgfqpoint{1.331cm}{1.3cm}}{\pgfqpoint{1.345cm}{1.335cm}}{\pgfqpoint{1.345cm}{1.371cm}}
\pgfusepath{fill}
\begin{pgfscope}
\pgfsetdash{}{0cm}
\pgfsetlinewidth{0.818mm}
\pgfsetroundcap
\pgfsetmiterlimit{4.0}
\pgfpathmoveto{\pgfqpoint{0.682cm}{0.671cm}}
\pgfpathlineto{\pgfqpoint{0.682cm}{0.042cm}}
\pgfusepath{stroke}
\end{pgfscope}
\end{pgfscope}
\end{pgfscope}
\end{pgfscope}
\end{tikzpicture}}} + \phi + \psi,X^{\!\resizebox{0.6em}{!}{
\begin{tikzpicture}
\pgfpathmoveto{\pgfqpoint{0cm}{0cm}}
\pgfpathlineto{\pgfqpoint{1.376cm}{0cm}}
\pgfpathlineto{\pgfqpoint{1.376cm}{1.588cm}}
\pgfpathlineto{\pgfqpoint{0cm}{1.588cm}}
\pgfpathclose
\pgfusepath{clip}
\begin{pgfscope}
\begin{pgfscope}
\pgfpathmoveto{\pgfqpoint{0cm}{0cm}}
\pgfpathlineto{\pgfqpoint{1.376cm}{0cm}}
\pgfpathlineto{\pgfqpoint{1.376cm}{1.588cm}}
\pgfpathlineto{\pgfqpoint{0cm}{1.588cm}}
\pgfpathclose
\pgfusepath{clip}
\begin{pgfscope}
\begin{pgfscope}
\definecolor{eps2pgf_color}{gray}{0.976471}\pgfsetstrokecolor{eps2pgf_color}\pgfsetfillcolor{eps2pgf_color}
\pgfpathmoveto{\pgfqpoint{0cm}{0cm}}
\pgfpathlineto{\pgfqpoint{1.376cm}{0cm}}
\pgfpathlineto{\pgfqpoint{1.376cm}{1.588cm}}
\pgfpathlineto{\pgfqpoint{0cm}{1.588cm}}
\pgfpathclose
\pgfusepath{fill}
\end{pgfscope}
\begin{pgfscope}
\pgfsetdash{}{0cm}
\pgfsetlinewidth{0.818mm}
\pgfsetroundcap
\pgfsetroundjoin
\pgfsetmiterlimit{7.0}
\definecolor{eps2pgf_color}{gray}{0}\pgfsetstrokecolor{eps2pgf_color}\pgfsetfillcolor{eps2pgf_color}
\pgfpathmoveto{\pgfqpoint{0.117cm}{1.476cm}}
\pgfpathlineto{\pgfqpoint{0.682cm}{0.726cm}}
\pgfpathlineto{\pgfqpoint{1.246cm}{1.476cm}}
\pgfusepath{stroke}
\end{pgfscope}
\definecolor{eps2pgf_color}{gray}{0}\pgfsetstrokecolor{eps2pgf_color}\pgfsetfillcolor{eps2pgf_color}
\pgfpathmoveto{\pgfqpoint{0.273cm}{1.451cm}}
\pgfpathcurveto{\pgfqpoint{0.273cm}{1.487cm}}{\pgfqpoint{0.259cm}{1.522cm}}{\pgfqpoint{0.233cm}{1.547cm}}
\pgfpathcurveto{\pgfqpoint{0.207cm}{1.573cm}}{\pgfqpoint{0.173cm}{1.588cm}}{\pgfqpoint{0.137cm}{1.588cm}}
\pgfpathcurveto{\pgfqpoint{0.1cm}{1.588cm}}{\pgfqpoint{0.066cm}{1.573cm}}{\pgfqpoint{0.04cm}{1.547cm}}
\pgfpathcurveto{\pgfqpoint{0.014cm}{1.522cm}}{\pgfqpoint{0cm}{1.487cm}}{\pgfqpoint{0cm}{1.451cm}}
\pgfpathcurveto{\pgfqpoint{0cm}{1.414cm}}{\pgfqpoint{0.014cm}{1.379cm}}{\pgfqpoint{0.04cm}{1.354cm}}
\pgfpathcurveto{\pgfqpoint{0.066cm}{1.328cm}}{\pgfqpoint{0.1cm}{1.314cm}}{\pgfqpoint{0.137cm}{1.314cm}}
\pgfpathcurveto{\pgfqpoint{0.173cm}{1.314cm}}{\pgfqpoint{0.207cm}{1.328cm}}{\pgfqpoint{0.233cm}{1.354cm}}
\pgfpathcurveto{\pgfqpoint{0.259cm}{1.379cm}}{\pgfqpoint{0.273cm}{1.414cm}}{\pgfqpoint{0.273cm}{1.451cm}}
\pgfusepath{fill}
\pgfpathmoveto{\pgfqpoint{1.345cm}{1.426cm}}
\pgfpathcurveto{\pgfqpoint{1.345cm}{1.463cm}}{\pgfqpoint{1.331cm}{1.497cm}}{\pgfqpoint{1.305cm}{1.523cm}}
\pgfpathcurveto{\pgfqpoint{1.28cm}{1.549cm}}{\pgfqpoint{1.245cm}{1.563cm}}{\pgfqpoint{1.209cm}{1.563cm}}
\pgfpathcurveto{\pgfqpoint{1.172cm}{1.563cm}}{\pgfqpoint{1.138cm}{1.549cm}}{\pgfqpoint{1.112cm}{1.523cm}}
\pgfpathcurveto{\pgfqpoint{1.087cm}{1.497cm}}{\pgfqpoint{1.072cm}{1.463cm}}{\pgfqpoint{1.072cm}{1.426cm}}
\pgfpathcurveto{\pgfqpoint{1.072cm}{1.39cm}}{\pgfqpoint{1.087cm}{1.355cm}}{\pgfqpoint{1.112cm}{1.329cm}}
\pgfpathcurveto{\pgfqpoint{1.138cm}{1.304cm}}{\pgfqpoint{1.172cm}{1.289cm}}{\pgfqpoint{1.209cm}{1.289cm}}
\pgfpathcurveto{\pgfqpoint{1.245cm}{1.289cm}}{\pgfqpoint{1.28cm}{1.304cm}}{\pgfqpoint{1.305cm}{1.329cm}}
\pgfpathcurveto{\pgfqpoint{1.331cm}{1.355cm}}{\pgfqpoint{1.345cm}{1.39cm}}{\pgfqpoint{1.345cm}{1.426cm}}
\pgfusepath{fill}
\begin{pgfscope}
\pgfsetdash{}{0cm}
\pgfsetlinewidth{0.818mm}
\pgfsetroundcap
\pgfsetmiterlimit{4.0}
\pgfpathmoveto{\pgfqpoint{0.682cm}{0.726cm}}
\pgfpathlineto{\pgfqpoint{0.682cm}{0.097cm}}
\pgfusepath{stroke}
\end{pgfscope}
\end{pgfscope}
\end{pgfscope}
\end{pgfscope}
\end{tikzpicture}}},\llbracket X^2 \rrbracket)\|_{\CC^\gamma(\rho^{3+\gamma})}&\lesssim 1+\|\phi+\psi\|_{\CC^{\frac{1}{2}}(\rho^{2+\alpha})}\lesssim 1+\|\psi\|_{L^\infty(\rho)}^\varepsilon+\|\psi\|_{\CC^{{1+\alpha}}(\rho^{2+\alpha})},
\end{align*}
\begin{align*}
\|6\mathrm{com}(X^{\!\resizebox{0.6em}{!}{
\begin{tikzpicture}
\pgfpathmoveto{\pgfqpoint{0cm}{-0.035cm}}
\pgfpathlineto{\pgfqpoint{1.376cm}{-0.035cm}}
\pgfpathlineto{\pgfqpoint{1.376cm}{1.552cm}}
\pgfpathlineto{\pgfqpoint{0cm}{1.552cm}}
\pgfpathclose
\pgfusepath{clip}
\begin{pgfscope}
\begin{pgfscope}
\pgfpathmoveto{\pgfqpoint{0cm}{-0.035cm}}
\pgfpathlineto{\pgfqpoint{1.376cm}{-0.035cm}}
\pgfpathlineto{\pgfqpoint{1.376cm}{1.552cm}}
\pgfpathlineto{\pgfqpoint{0cm}{1.552cm}}
\pgfpathclose
\pgfusepath{clip}
\begin{pgfscope}
\begin{pgfscope}
\pgfsetdash{}{0cm}
\pgfsetlinewidth{0.818mm}
\pgfsetroundcap
\pgfsetroundjoin
\pgfsetmiterlimit{7.0}
\definecolor{eps2pgf_color}{gray}{0}\pgfsetstrokecolor{eps2pgf_color}\pgfsetfillcolor{eps2pgf_color}
\pgfpathmoveto{\pgfqpoint{0.117cm}{1.421cm}}
\pgfpathlineto{\pgfqpoint{0.682cm}{0.671cm}}
\pgfpathlineto{\pgfqpoint{1.246cm}{1.421cm}}
\pgfusepath{stroke}
\end{pgfscope}
\definecolor{eps2pgf_color}{gray}{0}\pgfsetstrokecolor{eps2pgf_color}\pgfsetfillcolor{eps2pgf_color}
\pgfpathmoveto{\pgfqpoint{0.273cm}{1.395cm}}
\pgfpathcurveto{\pgfqpoint{0.273cm}{1.432cm}}{\pgfqpoint{0.259cm}{1.467cm}}{\pgfqpoint{0.233cm}{1.492cm}}
\pgfpathcurveto{\pgfqpoint{0.207cm}{1.518cm}}{\pgfqpoint{0.173cm}{1.532cm}}{\pgfqpoint{0.137cm}{1.532cm}}
\pgfpathcurveto{\pgfqpoint{0.1cm}{1.532cm}}{\pgfqpoint{0.066cm}{1.518cm}}{\pgfqpoint{0.04cm}{1.492cm}}
\pgfpathcurveto{\pgfqpoint{0.014cm}{1.467cm}}{\pgfqpoint{0cm}{1.432cm}}{\pgfqpoint{0cm}{1.395cm}}
\pgfpathcurveto{\pgfqpoint{0cm}{1.359cm}}{\pgfqpoint{0.014cm}{1.324cm}}{\pgfqpoint{0.04cm}{1.299cm}}
\pgfpathcurveto{\pgfqpoint{0.066cm}{1.273cm}}{\pgfqpoint{0.1cm}{1.258cm}}{\pgfqpoint{0.137cm}{1.258cm}}
\pgfpathcurveto{\pgfqpoint{0.173cm}{1.258cm}}{\pgfqpoint{0.207cm}{1.273cm}}{\pgfqpoint{0.233cm}{1.299cm}}
\pgfpathcurveto{\pgfqpoint{0.259cm}{1.324cm}}{\pgfqpoint{0.273cm}{1.359cm}}{\pgfqpoint{0.273cm}{1.395cm}}
\pgfusepath{fill}
\begin{pgfscope}
\pgfsetdash{}{0cm}
\pgfsetlinewidth{0.818mm}
\pgfsetmiterlimit{7.0}
\pgfpathmoveto{\pgfqpoint{0.682cm}{0.671cm}}
\pgfpathlineto{\pgfqpoint{0.679cm}{1.418cm}}
\pgfusepath{stroke}
\end{pgfscope}
\pgfpathmoveto{\pgfqpoint{0.815cm}{1.399cm}}
\pgfpathcurveto{\pgfqpoint{0.815cm}{1.435cm}}{\pgfqpoint{0.801cm}{1.47cm}}{\pgfqpoint{0.775cm}{1.496cm}}
\pgfpathcurveto{\pgfqpoint{0.75cm}{1.521cm}}{\pgfqpoint{0.715cm}{1.536cm}}{\pgfqpoint{0.679cm}{1.536cm}}
\pgfpathcurveto{\pgfqpoint{0.643cm}{1.536cm}}{\pgfqpoint{0.608cm}{1.521cm}}{\pgfqpoint{0.582cm}{1.496cm}}
\pgfpathcurveto{\pgfqpoint{0.557cm}{1.47cm}}{\pgfqpoint{0.542cm}{1.435cm}}{\pgfqpoint{0.542cm}{1.399cm}}
\pgfpathcurveto{\pgfqpoint{0.542cm}{1.363cm}}{\pgfqpoint{0.557cm}{1.328cm}}{\pgfqpoint{0.582cm}{1.302cm}}
\pgfpathcurveto{\pgfqpoint{0.608cm}{1.276cm}}{\pgfqpoint{0.643cm}{1.262cm}}{\pgfqpoint{0.679cm}{1.262cm}}
\pgfpathcurveto{\pgfqpoint{0.715cm}{1.262cm}}{\pgfqpoint{0.75cm}{1.276cm}}{\pgfqpoint{0.775cm}{1.302cm}}
\pgfpathcurveto{\pgfqpoint{0.801cm}{1.328cm}}{\pgfqpoint{0.815cm}{1.363cm}}{\pgfqpoint{0.815cm}{1.399cm}}
\pgfusepath{fill}
\pgfpathmoveto{\pgfqpoint{1.345cm}{1.371cm}}
\pgfpathcurveto{\pgfqpoint{1.345cm}{1.408cm}}{\pgfqpoint{1.331cm}{1.442cm}}{\pgfqpoint{1.305cm}{1.468cm}}
\pgfpathcurveto{\pgfqpoint{1.28cm}{1.494cm}}{\pgfqpoint{1.245cm}{1.508cm}}{\pgfqpoint{1.209cm}{1.508cm}}
\pgfpathcurveto{\pgfqpoint{1.172cm}{1.508cm}}{\pgfqpoint{1.138cm}{1.494cm}}{\pgfqpoint{1.112cm}{1.468cm}}
\pgfpathcurveto{\pgfqpoint{1.087cm}{1.442cm}}{\pgfqpoint{1.072cm}{1.408cm}}{\pgfqpoint{1.072cm}{1.371cm}}
\pgfpathcurveto{\pgfqpoint{1.072cm}{1.335cm}}{\pgfqpoint{1.087cm}{1.3cm}}{\pgfqpoint{1.112cm}{1.274cm}}
\pgfpathcurveto{\pgfqpoint{1.138cm}{1.249cm}}{\pgfqpoint{1.172cm}{1.234cm}}{\pgfqpoint{1.209cm}{1.234cm}}
\pgfpathcurveto{\pgfqpoint{1.245cm}{1.234cm}}{\pgfqpoint{1.28cm}{1.249cm}}{\pgfqpoint{1.305cm}{1.274cm}}
\pgfpathcurveto{\pgfqpoint{1.331cm}{1.3cm}}{\pgfqpoint{1.345cm}{1.335cm}}{\pgfqpoint{1.345cm}{1.371cm}}
\pgfusepath{fill}
\begin{pgfscope}
\pgfsetdash{}{0cm}
\pgfsetlinewidth{0.818mm}
\pgfsetroundcap
\pgfsetmiterlimit{4.0}
\pgfpathmoveto{\pgfqpoint{0.682cm}{0.671cm}}
\pgfpathlineto{\pgfqpoint{0.682cm}{0.042cm}}
\pgfusepath{stroke}
\end{pgfscope}
\end{pgfscope}
\end{pgfscope}
\end{pgfscope}
\end{tikzpicture}}},X^{\!\resizebox{0.6em}{!}{
\begin{tikzpicture}
\pgfpathmoveto{\pgfqpoint{0cm}{-0.035cm}}
\pgfpathlineto{\pgfqpoint{1.376cm}{-0.035cm}}
\pgfpathlineto{\pgfqpoint{1.376cm}{1.552cm}}
\pgfpathlineto{\pgfqpoint{0cm}{1.552cm}}
\pgfpathclose
\pgfusepath{clip}
\begin{pgfscope}
\begin{pgfscope}
\pgfpathmoveto{\pgfqpoint{0cm}{-0.035cm}}
\pgfpathlineto{\pgfqpoint{1.376cm}{-0.035cm}}
\pgfpathlineto{\pgfqpoint{1.376cm}{1.552cm}}
\pgfpathlineto{\pgfqpoint{0cm}{1.552cm}}
\pgfpathclose
\pgfusepath{clip}
\begin{pgfscope}
\begin{pgfscope}
\pgfsetdash{}{0cm}
\pgfsetlinewidth{0.818mm}
\pgfsetroundcap
\pgfsetroundjoin
\pgfsetmiterlimit{7.0}
\definecolor{eps2pgf_color}{gray}{0}\pgfsetstrokecolor{eps2pgf_color}\pgfsetfillcolor{eps2pgf_color}
\pgfpathmoveto{\pgfqpoint{0.117cm}{1.421cm}}
\pgfpathlineto{\pgfqpoint{0.682cm}{0.671cm}}
\pgfpathlineto{\pgfqpoint{1.246cm}{1.421cm}}
\pgfusepath{stroke}
\end{pgfscope}
\definecolor{eps2pgf_color}{gray}{0}\pgfsetstrokecolor{eps2pgf_color}\pgfsetfillcolor{eps2pgf_color}
\pgfpathmoveto{\pgfqpoint{0.273cm}{1.395cm}}
\pgfpathcurveto{\pgfqpoint{0.273cm}{1.432cm}}{\pgfqpoint{0.259cm}{1.467cm}}{\pgfqpoint{0.233cm}{1.492cm}}
\pgfpathcurveto{\pgfqpoint{0.207cm}{1.518cm}}{\pgfqpoint{0.173cm}{1.532cm}}{\pgfqpoint{0.137cm}{1.532cm}}
\pgfpathcurveto{\pgfqpoint{0.1cm}{1.532cm}}{\pgfqpoint{0.066cm}{1.518cm}}{\pgfqpoint{0.04cm}{1.492cm}}
\pgfpathcurveto{\pgfqpoint{0.014cm}{1.467cm}}{\pgfqpoint{0cm}{1.432cm}}{\pgfqpoint{0cm}{1.395cm}}
\pgfpathcurveto{\pgfqpoint{0cm}{1.359cm}}{\pgfqpoint{0.014cm}{1.324cm}}{\pgfqpoint{0.04cm}{1.299cm}}
\pgfpathcurveto{\pgfqpoint{0.066cm}{1.273cm}}{\pgfqpoint{0.1cm}{1.258cm}}{\pgfqpoint{0.137cm}{1.258cm}}
\pgfpathcurveto{\pgfqpoint{0.173cm}{1.258cm}}{\pgfqpoint{0.207cm}{1.273cm}}{\pgfqpoint{0.233cm}{1.299cm}}
\pgfpathcurveto{\pgfqpoint{0.259cm}{1.324cm}}{\pgfqpoint{0.273cm}{1.359cm}}{\pgfqpoint{0.273cm}{1.395cm}}
\pgfusepath{fill}
\begin{pgfscope}
\pgfsetdash{}{0cm}
\pgfsetlinewidth{0.818mm}
\pgfsetmiterlimit{7.0}
\pgfpathmoveto{\pgfqpoint{0.682cm}{0.671cm}}
\pgfpathlineto{\pgfqpoint{0.679cm}{1.418cm}}
\pgfusepath{stroke}
\end{pgfscope}
\pgfpathmoveto{\pgfqpoint{0.815cm}{1.399cm}}
\pgfpathcurveto{\pgfqpoint{0.815cm}{1.435cm}}{\pgfqpoint{0.801cm}{1.47cm}}{\pgfqpoint{0.775cm}{1.496cm}}
\pgfpathcurveto{\pgfqpoint{0.75cm}{1.521cm}}{\pgfqpoint{0.715cm}{1.536cm}}{\pgfqpoint{0.679cm}{1.536cm}}
\pgfpathcurveto{\pgfqpoint{0.643cm}{1.536cm}}{\pgfqpoint{0.608cm}{1.521cm}}{\pgfqpoint{0.582cm}{1.496cm}}
\pgfpathcurveto{\pgfqpoint{0.557cm}{1.47cm}}{\pgfqpoint{0.542cm}{1.435cm}}{\pgfqpoint{0.542cm}{1.399cm}}
\pgfpathcurveto{\pgfqpoint{0.542cm}{1.363cm}}{\pgfqpoint{0.557cm}{1.328cm}}{\pgfqpoint{0.582cm}{1.302cm}}
\pgfpathcurveto{\pgfqpoint{0.608cm}{1.276cm}}{\pgfqpoint{0.643cm}{1.262cm}}{\pgfqpoint{0.679cm}{1.262cm}}
\pgfpathcurveto{\pgfqpoint{0.715cm}{1.262cm}}{\pgfqpoint{0.75cm}{1.276cm}}{\pgfqpoint{0.775cm}{1.302cm}}
\pgfpathcurveto{\pgfqpoint{0.801cm}{1.328cm}}{\pgfqpoint{0.815cm}{1.363cm}}{\pgfqpoint{0.815cm}{1.399cm}}
\pgfusepath{fill}
\pgfpathmoveto{\pgfqpoint{1.345cm}{1.371cm}}
\pgfpathcurveto{\pgfqpoint{1.345cm}{1.408cm}}{\pgfqpoint{1.331cm}{1.442cm}}{\pgfqpoint{1.305cm}{1.468cm}}
\pgfpathcurveto{\pgfqpoint{1.28cm}{1.494cm}}{\pgfqpoint{1.245cm}{1.508cm}}{\pgfqpoint{1.209cm}{1.508cm}}
\pgfpathcurveto{\pgfqpoint{1.172cm}{1.508cm}}{\pgfqpoint{1.138cm}{1.494cm}}{\pgfqpoint{1.112cm}{1.468cm}}
\pgfpathcurveto{\pgfqpoint{1.087cm}{1.442cm}}{\pgfqpoint{1.072cm}{1.408cm}}{\pgfqpoint{1.072cm}{1.371cm}}
\pgfpathcurveto{\pgfqpoint{1.072cm}{1.335cm}}{\pgfqpoint{1.087cm}{1.3cm}}{\pgfqpoint{1.112cm}{1.274cm}}
\pgfpathcurveto{\pgfqpoint{1.138cm}{1.249cm}}{\pgfqpoint{1.172cm}{1.234cm}}{\pgfqpoint{1.209cm}{1.234cm}}
\pgfpathcurveto{\pgfqpoint{1.245cm}{1.234cm}}{\pgfqpoint{1.28cm}{1.249cm}}{\pgfqpoint{1.305cm}{1.274cm}}
\pgfpathcurveto{\pgfqpoint{1.331cm}{1.3cm}}{\pgfqpoint{1.345cm}{1.335cm}}{\pgfqpoint{1.345cm}{1.371cm}}
\pgfusepath{fill}
\begin{pgfscope}
\pgfsetdash{}{0cm}
\pgfsetlinewidth{0.818mm}
\pgfsetroundcap
\pgfsetmiterlimit{4.0}
\pgfpathmoveto{\pgfqpoint{0.682cm}{0.671cm}}
\pgfpathlineto{\pgfqpoint{0.682cm}{0.042cm}}
\pgfusepath{stroke}
\end{pgfscope}
\end{pgfscope}
\end{pgfscope}
\end{pgfscope}
\end{tikzpicture}}},X)\|_{\CC^\gamma(\rho^{3+\gamma})}\lesssim 1,
\end{align*}
\begin{align*}
\|6X\circ(X^{\!\resizebox{0.6em}{!}{
\begin{tikzpicture}
\pgfpathmoveto{\pgfqpoint{0cm}{-0.035cm}}
\pgfpathlineto{\pgfqpoint{1.376cm}{-0.035cm}}
\pgfpathlineto{\pgfqpoint{1.376cm}{1.552cm}}
\pgfpathlineto{\pgfqpoint{0cm}{1.552cm}}
\pgfpathclose
\pgfusepath{clip}
\begin{pgfscope}
\begin{pgfscope}
\pgfpathmoveto{\pgfqpoint{0cm}{-0.035cm}}
\pgfpathlineto{\pgfqpoint{1.376cm}{-0.035cm}}
\pgfpathlineto{\pgfqpoint{1.376cm}{1.552cm}}
\pgfpathlineto{\pgfqpoint{0cm}{1.552cm}}
\pgfpathclose
\pgfusepath{clip}
\begin{pgfscope}
\begin{pgfscope}
\pgfsetdash{}{0cm}
\pgfsetlinewidth{0.818mm}
\pgfsetroundcap
\pgfsetroundjoin
\pgfsetmiterlimit{7.0}
\definecolor{eps2pgf_color}{gray}{0}\pgfsetstrokecolor{eps2pgf_color}\pgfsetfillcolor{eps2pgf_color}
\pgfpathmoveto{\pgfqpoint{0.117cm}{1.421cm}}
\pgfpathlineto{\pgfqpoint{0.682cm}{0.671cm}}
\pgfpathlineto{\pgfqpoint{1.246cm}{1.421cm}}
\pgfusepath{stroke}
\end{pgfscope}
\definecolor{eps2pgf_color}{gray}{0}\pgfsetstrokecolor{eps2pgf_color}\pgfsetfillcolor{eps2pgf_color}
\pgfpathmoveto{\pgfqpoint{0.273cm}{1.395cm}}
\pgfpathcurveto{\pgfqpoint{0.273cm}{1.432cm}}{\pgfqpoint{0.259cm}{1.467cm}}{\pgfqpoint{0.233cm}{1.492cm}}
\pgfpathcurveto{\pgfqpoint{0.207cm}{1.518cm}}{\pgfqpoint{0.173cm}{1.532cm}}{\pgfqpoint{0.137cm}{1.532cm}}
\pgfpathcurveto{\pgfqpoint{0.1cm}{1.532cm}}{\pgfqpoint{0.066cm}{1.518cm}}{\pgfqpoint{0.04cm}{1.492cm}}
\pgfpathcurveto{\pgfqpoint{0.014cm}{1.467cm}}{\pgfqpoint{0cm}{1.432cm}}{\pgfqpoint{0cm}{1.395cm}}
\pgfpathcurveto{\pgfqpoint{0cm}{1.359cm}}{\pgfqpoint{0.014cm}{1.324cm}}{\pgfqpoint{0.04cm}{1.299cm}}
\pgfpathcurveto{\pgfqpoint{0.066cm}{1.273cm}}{\pgfqpoint{0.1cm}{1.258cm}}{\pgfqpoint{0.137cm}{1.258cm}}
\pgfpathcurveto{\pgfqpoint{0.173cm}{1.258cm}}{\pgfqpoint{0.207cm}{1.273cm}}{\pgfqpoint{0.233cm}{1.299cm}}
\pgfpathcurveto{\pgfqpoint{0.259cm}{1.324cm}}{\pgfqpoint{0.273cm}{1.359cm}}{\pgfqpoint{0.273cm}{1.395cm}}
\pgfusepath{fill}
\begin{pgfscope}
\pgfsetdash{}{0cm}
\pgfsetlinewidth{0.818mm}
\pgfsetmiterlimit{7.0}
\pgfpathmoveto{\pgfqpoint{0.682cm}{0.671cm}}
\pgfpathlineto{\pgfqpoint{0.679cm}{1.418cm}}
\pgfusepath{stroke}
\end{pgfscope}
\pgfpathmoveto{\pgfqpoint{0.815cm}{1.399cm}}
\pgfpathcurveto{\pgfqpoint{0.815cm}{1.435cm}}{\pgfqpoint{0.801cm}{1.47cm}}{\pgfqpoint{0.775cm}{1.496cm}}
\pgfpathcurveto{\pgfqpoint{0.75cm}{1.521cm}}{\pgfqpoint{0.715cm}{1.536cm}}{\pgfqpoint{0.679cm}{1.536cm}}
\pgfpathcurveto{\pgfqpoint{0.643cm}{1.536cm}}{\pgfqpoint{0.608cm}{1.521cm}}{\pgfqpoint{0.582cm}{1.496cm}}
\pgfpathcurveto{\pgfqpoint{0.557cm}{1.47cm}}{\pgfqpoint{0.542cm}{1.435cm}}{\pgfqpoint{0.542cm}{1.399cm}}
\pgfpathcurveto{\pgfqpoint{0.542cm}{1.363cm}}{\pgfqpoint{0.557cm}{1.328cm}}{\pgfqpoint{0.582cm}{1.302cm}}
\pgfpathcurveto{\pgfqpoint{0.608cm}{1.276cm}}{\pgfqpoint{0.643cm}{1.262cm}}{\pgfqpoint{0.679cm}{1.262cm}}
\pgfpathcurveto{\pgfqpoint{0.715cm}{1.262cm}}{\pgfqpoint{0.75cm}{1.276cm}}{\pgfqpoint{0.775cm}{1.302cm}}
\pgfpathcurveto{\pgfqpoint{0.801cm}{1.328cm}}{\pgfqpoint{0.815cm}{1.363cm}}{\pgfqpoint{0.815cm}{1.399cm}}
\pgfusepath{fill}
\pgfpathmoveto{\pgfqpoint{1.345cm}{1.371cm}}
\pgfpathcurveto{\pgfqpoint{1.345cm}{1.408cm}}{\pgfqpoint{1.331cm}{1.442cm}}{\pgfqpoint{1.305cm}{1.468cm}}
\pgfpathcurveto{\pgfqpoint{1.28cm}{1.494cm}}{\pgfqpoint{1.245cm}{1.508cm}}{\pgfqpoint{1.209cm}{1.508cm}}
\pgfpathcurveto{\pgfqpoint{1.172cm}{1.508cm}}{\pgfqpoint{1.138cm}{1.494cm}}{\pgfqpoint{1.112cm}{1.468cm}}
\pgfpathcurveto{\pgfqpoint{1.087cm}{1.442cm}}{\pgfqpoint{1.072cm}{1.408cm}}{\pgfqpoint{1.072cm}{1.371cm}}
\pgfpathcurveto{\pgfqpoint{1.072cm}{1.335cm}}{\pgfqpoint{1.087cm}{1.3cm}}{\pgfqpoint{1.112cm}{1.274cm}}
\pgfpathcurveto{\pgfqpoint{1.138cm}{1.249cm}}{\pgfqpoint{1.172cm}{1.234cm}}{\pgfqpoint{1.209cm}{1.234cm}}
\pgfpathcurveto{\pgfqpoint{1.245cm}{1.234cm}}{\pgfqpoint{1.28cm}{1.249cm}}{\pgfqpoint{1.305cm}{1.274cm}}
\pgfpathcurveto{\pgfqpoint{1.331cm}{1.3cm}}{\pgfqpoint{1.345cm}{1.335cm}}{\pgfqpoint{1.345cm}{1.371cm}}
\pgfusepath{fill}
\begin{pgfscope}
\pgfsetdash{}{0cm}
\pgfsetlinewidth{0.818mm}
\pgfsetroundcap
\pgfsetmiterlimit{4.0}
\pgfpathmoveto{\pgfqpoint{0.682cm}{0.671cm}}
\pgfpathlineto{\pgfqpoint{0.682cm}{0.042cm}}
\pgfusepath{stroke}
\end{pgfscope}
\end{pgfscope}
\end{pgfscope}
\end{pgfscope}
\end{tikzpicture}}}\preccurlyeq(\phi+\psi))\|_{\CC^\gamma(\rho^{3+\gamma})}\lesssim\|\phi+\psi\|_{\CC^{\frac{1}{2}+\alpha}(\rho^{2+\alpha})}\lesssim  \|\psi\|_{L^\infty(\rho)}^\varepsilon+\|\psi\|_{\CC^{1+\alpha}(\rho^{2+\alpha})},
\end{align*}
\begin{align*}
\|6\mathrm{com}(\phi+\psi,X^{\!\resizebox{0.6em}{!}{
\begin{tikzpicture}
\pgfpathmoveto{\pgfqpoint{0cm}{-0.035cm}}
\pgfpathlineto{\pgfqpoint{1.376cm}{-0.035cm}}
\pgfpathlineto{\pgfqpoint{1.376cm}{1.552cm}}
\pgfpathlineto{\pgfqpoint{0cm}{1.552cm}}
\pgfpathclose
\pgfusepath{clip}
\begin{pgfscope}
\begin{pgfscope}
\pgfpathmoveto{\pgfqpoint{0cm}{-0.035cm}}
\pgfpathlineto{\pgfqpoint{1.376cm}{-0.035cm}}
\pgfpathlineto{\pgfqpoint{1.376cm}{1.552cm}}
\pgfpathlineto{\pgfqpoint{0cm}{1.552cm}}
\pgfpathclose
\pgfusepath{clip}
\begin{pgfscope}
\begin{pgfscope}
\pgfsetdash{}{0cm}
\pgfsetlinewidth{0.818mm}
\pgfsetroundcap
\pgfsetroundjoin
\pgfsetmiterlimit{7.0}
\definecolor{eps2pgf_color}{gray}{0}\pgfsetstrokecolor{eps2pgf_color}\pgfsetfillcolor{eps2pgf_color}
\pgfpathmoveto{\pgfqpoint{0.117cm}{1.421cm}}
\pgfpathlineto{\pgfqpoint{0.682cm}{0.671cm}}
\pgfpathlineto{\pgfqpoint{1.246cm}{1.421cm}}
\pgfusepath{stroke}
\end{pgfscope}
\definecolor{eps2pgf_color}{gray}{0}\pgfsetstrokecolor{eps2pgf_color}\pgfsetfillcolor{eps2pgf_color}
\pgfpathmoveto{\pgfqpoint{0.273cm}{1.395cm}}
\pgfpathcurveto{\pgfqpoint{0.273cm}{1.432cm}}{\pgfqpoint{0.259cm}{1.467cm}}{\pgfqpoint{0.233cm}{1.492cm}}
\pgfpathcurveto{\pgfqpoint{0.207cm}{1.518cm}}{\pgfqpoint{0.173cm}{1.532cm}}{\pgfqpoint{0.137cm}{1.532cm}}
\pgfpathcurveto{\pgfqpoint{0.1cm}{1.532cm}}{\pgfqpoint{0.066cm}{1.518cm}}{\pgfqpoint{0.04cm}{1.492cm}}
\pgfpathcurveto{\pgfqpoint{0.014cm}{1.467cm}}{\pgfqpoint{0cm}{1.432cm}}{\pgfqpoint{0cm}{1.395cm}}
\pgfpathcurveto{\pgfqpoint{0cm}{1.359cm}}{\pgfqpoint{0.014cm}{1.324cm}}{\pgfqpoint{0.04cm}{1.299cm}}
\pgfpathcurveto{\pgfqpoint{0.066cm}{1.273cm}}{\pgfqpoint{0.1cm}{1.258cm}}{\pgfqpoint{0.137cm}{1.258cm}}
\pgfpathcurveto{\pgfqpoint{0.173cm}{1.258cm}}{\pgfqpoint{0.207cm}{1.273cm}}{\pgfqpoint{0.233cm}{1.299cm}}
\pgfpathcurveto{\pgfqpoint{0.259cm}{1.324cm}}{\pgfqpoint{0.273cm}{1.359cm}}{\pgfqpoint{0.273cm}{1.395cm}}
\pgfusepath{fill}
\begin{pgfscope}
\pgfsetdash{}{0cm}
\pgfsetlinewidth{0.818mm}
\pgfsetmiterlimit{7.0}
\pgfpathmoveto{\pgfqpoint{0.682cm}{0.671cm}}
\pgfpathlineto{\pgfqpoint{0.679cm}{1.418cm}}
\pgfusepath{stroke}
\end{pgfscope}
\pgfpathmoveto{\pgfqpoint{0.815cm}{1.399cm}}
\pgfpathcurveto{\pgfqpoint{0.815cm}{1.435cm}}{\pgfqpoint{0.801cm}{1.47cm}}{\pgfqpoint{0.775cm}{1.496cm}}
\pgfpathcurveto{\pgfqpoint{0.75cm}{1.521cm}}{\pgfqpoint{0.715cm}{1.536cm}}{\pgfqpoint{0.679cm}{1.536cm}}
\pgfpathcurveto{\pgfqpoint{0.643cm}{1.536cm}}{\pgfqpoint{0.608cm}{1.521cm}}{\pgfqpoint{0.582cm}{1.496cm}}
\pgfpathcurveto{\pgfqpoint{0.557cm}{1.47cm}}{\pgfqpoint{0.542cm}{1.435cm}}{\pgfqpoint{0.542cm}{1.399cm}}
\pgfpathcurveto{\pgfqpoint{0.542cm}{1.363cm}}{\pgfqpoint{0.557cm}{1.328cm}}{\pgfqpoint{0.582cm}{1.302cm}}
\pgfpathcurveto{\pgfqpoint{0.608cm}{1.276cm}}{\pgfqpoint{0.643cm}{1.262cm}}{\pgfqpoint{0.679cm}{1.262cm}}
\pgfpathcurveto{\pgfqpoint{0.715cm}{1.262cm}}{\pgfqpoint{0.75cm}{1.276cm}}{\pgfqpoint{0.775cm}{1.302cm}}
\pgfpathcurveto{\pgfqpoint{0.801cm}{1.328cm}}{\pgfqpoint{0.815cm}{1.363cm}}{\pgfqpoint{0.815cm}{1.399cm}}
\pgfusepath{fill}
\pgfpathmoveto{\pgfqpoint{1.345cm}{1.371cm}}
\pgfpathcurveto{\pgfqpoint{1.345cm}{1.408cm}}{\pgfqpoint{1.331cm}{1.442cm}}{\pgfqpoint{1.305cm}{1.468cm}}
\pgfpathcurveto{\pgfqpoint{1.28cm}{1.494cm}}{\pgfqpoint{1.245cm}{1.508cm}}{\pgfqpoint{1.209cm}{1.508cm}}
\pgfpathcurveto{\pgfqpoint{1.172cm}{1.508cm}}{\pgfqpoint{1.138cm}{1.494cm}}{\pgfqpoint{1.112cm}{1.468cm}}
\pgfpathcurveto{\pgfqpoint{1.087cm}{1.442cm}}{\pgfqpoint{1.072cm}{1.408cm}}{\pgfqpoint{1.072cm}{1.371cm}}
\pgfpathcurveto{\pgfqpoint{1.072cm}{1.335cm}}{\pgfqpoint{1.087cm}{1.3cm}}{\pgfqpoint{1.112cm}{1.274cm}}
\pgfpathcurveto{\pgfqpoint{1.138cm}{1.249cm}}{\pgfqpoint{1.172cm}{1.234cm}}{\pgfqpoint{1.209cm}{1.234cm}}
\pgfpathcurveto{\pgfqpoint{1.245cm}{1.234cm}}{\pgfqpoint{1.28cm}{1.249cm}}{\pgfqpoint{1.305cm}{1.274cm}}
\pgfpathcurveto{\pgfqpoint{1.331cm}{1.3cm}}{\pgfqpoint{1.345cm}{1.335cm}}{\pgfqpoint{1.345cm}{1.371cm}}
\pgfusepath{fill}
\begin{pgfscope}
\pgfsetdash{}{0cm}
\pgfsetlinewidth{0.818mm}
\pgfsetroundcap
\pgfsetmiterlimit{4.0}
\pgfpathmoveto{\pgfqpoint{0.682cm}{0.671cm}}
\pgfpathlineto{\pgfqpoint{0.682cm}{0.042cm}}
\pgfusepath{stroke}
\end{pgfscope}
\end{pgfscope}
\end{pgfscope}
\end{pgfscope}
\end{tikzpicture}}},X)\|_{\CC^\gamma(\rho^{3+\gamma})}\lesssim\|\phi+\psi\|_{\CC^{\frac{1}{2}}(\rho^{2+\alpha})}\lesssim \|\psi\|_{L^\infty(\rho)}^\varepsilon+ \|\psi\|_{\CC^{1+\alpha}(\rho^{2+\alpha})},
\end{align*}
\begin{align*}
&\|(-X^{\!\resizebox{0.6em}{!}{
\begin{tikzpicture}
\pgfpathmoveto{\pgfqpoint{0cm}{-0.035cm}}
\pgfpathlineto{\pgfqpoint{1.376cm}{-0.035cm}}
\pgfpathlineto{\pgfqpoint{1.376cm}{1.552cm}}
\pgfpathlineto{\pgfqpoint{0cm}{1.552cm}}
\pgfpathclose
\pgfusepath{clip}
\begin{pgfscope}
\begin{pgfscope}
\pgfpathmoveto{\pgfqpoint{0cm}{-0.035cm}}
\pgfpathlineto{\pgfqpoint{1.376cm}{-0.035cm}}
\pgfpathlineto{\pgfqpoint{1.376cm}{1.552cm}}
\pgfpathlineto{\pgfqpoint{0cm}{1.552cm}}
\pgfpathclose
\pgfusepath{clip}
\begin{pgfscope}
\begin{pgfscope}
\pgfsetdash{}{0cm}
\pgfsetlinewidth{0.818mm}
\pgfsetroundcap
\pgfsetroundjoin
\pgfsetmiterlimit{7.0}
\definecolor{eps2pgf_color}{gray}{0}\pgfsetstrokecolor{eps2pgf_color}\pgfsetfillcolor{eps2pgf_color}
\pgfpathmoveto{\pgfqpoint{0.117cm}{1.421cm}}
\pgfpathlineto{\pgfqpoint{0.682cm}{0.671cm}}
\pgfpathlineto{\pgfqpoint{1.246cm}{1.421cm}}
\pgfusepath{stroke}
\end{pgfscope}
\definecolor{eps2pgf_color}{gray}{0}\pgfsetstrokecolor{eps2pgf_color}\pgfsetfillcolor{eps2pgf_color}
\pgfpathmoveto{\pgfqpoint{0.273cm}{1.395cm}}
\pgfpathcurveto{\pgfqpoint{0.273cm}{1.432cm}}{\pgfqpoint{0.259cm}{1.467cm}}{\pgfqpoint{0.233cm}{1.492cm}}
\pgfpathcurveto{\pgfqpoint{0.207cm}{1.518cm}}{\pgfqpoint{0.173cm}{1.532cm}}{\pgfqpoint{0.137cm}{1.532cm}}
\pgfpathcurveto{\pgfqpoint{0.1cm}{1.532cm}}{\pgfqpoint{0.066cm}{1.518cm}}{\pgfqpoint{0.04cm}{1.492cm}}
\pgfpathcurveto{\pgfqpoint{0.014cm}{1.467cm}}{\pgfqpoint{0cm}{1.432cm}}{\pgfqpoint{0cm}{1.395cm}}
\pgfpathcurveto{\pgfqpoint{0cm}{1.359cm}}{\pgfqpoint{0.014cm}{1.324cm}}{\pgfqpoint{0.04cm}{1.299cm}}
\pgfpathcurveto{\pgfqpoint{0.066cm}{1.273cm}}{\pgfqpoint{0.1cm}{1.258cm}}{\pgfqpoint{0.137cm}{1.258cm}}
\pgfpathcurveto{\pgfqpoint{0.173cm}{1.258cm}}{\pgfqpoint{0.207cm}{1.273cm}}{\pgfqpoint{0.233cm}{1.299cm}}
\pgfpathcurveto{\pgfqpoint{0.259cm}{1.324cm}}{\pgfqpoint{0.273cm}{1.359cm}}{\pgfqpoint{0.273cm}{1.395cm}}
\pgfusepath{fill}
\begin{pgfscope}
\pgfsetdash{}{0cm}
\pgfsetlinewidth{0.818mm}
\pgfsetmiterlimit{7.0}
\pgfpathmoveto{\pgfqpoint{0.682cm}{0.671cm}}
\pgfpathlineto{\pgfqpoint{0.679cm}{1.418cm}}
\pgfusepath{stroke}
\end{pgfscope}
\pgfpathmoveto{\pgfqpoint{0.815cm}{1.399cm}}
\pgfpathcurveto{\pgfqpoint{0.815cm}{1.435cm}}{\pgfqpoint{0.801cm}{1.47cm}}{\pgfqpoint{0.775cm}{1.496cm}}
\pgfpathcurveto{\pgfqpoint{0.75cm}{1.521cm}}{\pgfqpoint{0.715cm}{1.536cm}}{\pgfqpoint{0.679cm}{1.536cm}}
\pgfpathcurveto{\pgfqpoint{0.643cm}{1.536cm}}{\pgfqpoint{0.608cm}{1.521cm}}{\pgfqpoint{0.582cm}{1.496cm}}
\pgfpathcurveto{\pgfqpoint{0.557cm}{1.47cm}}{\pgfqpoint{0.542cm}{1.435cm}}{\pgfqpoint{0.542cm}{1.399cm}}
\pgfpathcurveto{\pgfqpoint{0.542cm}{1.363cm}}{\pgfqpoint{0.557cm}{1.328cm}}{\pgfqpoint{0.582cm}{1.302cm}}
\pgfpathcurveto{\pgfqpoint{0.608cm}{1.276cm}}{\pgfqpoint{0.643cm}{1.262cm}}{\pgfqpoint{0.679cm}{1.262cm}}
\pgfpathcurveto{\pgfqpoint{0.715cm}{1.262cm}}{\pgfqpoint{0.75cm}{1.276cm}}{\pgfqpoint{0.775cm}{1.302cm}}
\pgfpathcurveto{\pgfqpoint{0.801cm}{1.328cm}}{\pgfqpoint{0.815cm}{1.363cm}}{\pgfqpoint{0.815cm}{1.399cm}}
\pgfusepath{fill}
\pgfpathmoveto{\pgfqpoint{1.345cm}{1.371cm}}
\pgfpathcurveto{\pgfqpoint{1.345cm}{1.408cm}}{\pgfqpoint{1.331cm}{1.442cm}}{\pgfqpoint{1.305cm}{1.468cm}}
\pgfpathcurveto{\pgfqpoint{1.28cm}{1.494cm}}{\pgfqpoint{1.245cm}{1.508cm}}{\pgfqpoint{1.209cm}{1.508cm}}
\pgfpathcurveto{\pgfqpoint{1.172cm}{1.508cm}}{\pgfqpoint{1.138cm}{1.494cm}}{\pgfqpoint{1.112cm}{1.468cm}}
\pgfpathcurveto{\pgfqpoint{1.087cm}{1.442cm}}{\pgfqpoint{1.072cm}{1.408cm}}{\pgfqpoint{1.072cm}{1.371cm}}
\pgfpathcurveto{\pgfqpoint{1.072cm}{1.335cm}}{\pgfqpoint{1.087cm}{1.3cm}}{\pgfqpoint{1.112cm}{1.274cm}}
\pgfpathcurveto{\pgfqpoint{1.138cm}{1.249cm}}{\pgfqpoint{1.172cm}{1.234cm}}{\pgfqpoint{1.209cm}{1.234cm}}
\pgfpathcurveto{\pgfqpoint{1.245cm}{1.234cm}}{\pgfqpoint{1.28cm}{1.249cm}}{\pgfqpoint{1.305cm}{1.274cm}}
\pgfpathcurveto{\pgfqpoint{1.331cm}{1.3cm}}{\pgfqpoint{1.345cm}{1.335cm}}{\pgfqpoint{1.345cm}{1.371cm}}
\pgfusepath{fill}
\begin{pgfscope}
\pgfsetdash{}{0cm}
\pgfsetlinewidth{0.818mm}
\pgfsetroundcap
\pgfsetmiterlimit{4.0}
\pgfpathmoveto{\pgfqpoint{0.682cm}{0.671cm}}
\pgfpathlineto{\pgfqpoint{0.682cm}{0.042cm}}
\pgfusepath{stroke}
\end{pgfscope}
\end{pgfscope}
\end{pgfscope}
\end{pgfscope}
\end{tikzpicture}}}+\phi)^3+3(-X^{\!\resizebox{0.6em}{!}{
\begin{tikzpicture}
\pgfpathmoveto{\pgfqpoint{0cm}{-0.035cm}}
\pgfpathlineto{\pgfqpoint{1.376cm}{-0.035cm}}
\pgfpathlineto{\pgfqpoint{1.376cm}{1.552cm}}
\pgfpathlineto{\pgfqpoint{0cm}{1.552cm}}
\pgfpathclose
\pgfusepath{clip}
\begin{pgfscope}
\begin{pgfscope}
\pgfpathmoveto{\pgfqpoint{0cm}{-0.035cm}}
\pgfpathlineto{\pgfqpoint{1.376cm}{-0.035cm}}
\pgfpathlineto{\pgfqpoint{1.376cm}{1.552cm}}
\pgfpathlineto{\pgfqpoint{0cm}{1.552cm}}
\pgfpathclose
\pgfusepath{clip}
\begin{pgfscope}
\begin{pgfscope}
\pgfsetdash{}{0cm}
\pgfsetlinewidth{0.818mm}
\pgfsetroundcap
\pgfsetroundjoin
\pgfsetmiterlimit{7.0}
\definecolor{eps2pgf_color}{gray}{0}\pgfsetstrokecolor{eps2pgf_color}\pgfsetfillcolor{eps2pgf_color}
\pgfpathmoveto{\pgfqpoint{0.117cm}{1.421cm}}
\pgfpathlineto{\pgfqpoint{0.682cm}{0.671cm}}
\pgfpathlineto{\pgfqpoint{1.246cm}{1.421cm}}
\pgfusepath{stroke}
\end{pgfscope}
\definecolor{eps2pgf_color}{gray}{0}\pgfsetstrokecolor{eps2pgf_color}\pgfsetfillcolor{eps2pgf_color}
\pgfpathmoveto{\pgfqpoint{0.273cm}{1.395cm}}
\pgfpathcurveto{\pgfqpoint{0.273cm}{1.432cm}}{\pgfqpoint{0.259cm}{1.467cm}}{\pgfqpoint{0.233cm}{1.492cm}}
\pgfpathcurveto{\pgfqpoint{0.207cm}{1.518cm}}{\pgfqpoint{0.173cm}{1.532cm}}{\pgfqpoint{0.137cm}{1.532cm}}
\pgfpathcurveto{\pgfqpoint{0.1cm}{1.532cm}}{\pgfqpoint{0.066cm}{1.518cm}}{\pgfqpoint{0.04cm}{1.492cm}}
\pgfpathcurveto{\pgfqpoint{0.014cm}{1.467cm}}{\pgfqpoint{0cm}{1.432cm}}{\pgfqpoint{0cm}{1.395cm}}
\pgfpathcurveto{\pgfqpoint{0cm}{1.359cm}}{\pgfqpoint{0.014cm}{1.324cm}}{\pgfqpoint{0.04cm}{1.299cm}}
\pgfpathcurveto{\pgfqpoint{0.066cm}{1.273cm}}{\pgfqpoint{0.1cm}{1.258cm}}{\pgfqpoint{0.137cm}{1.258cm}}
\pgfpathcurveto{\pgfqpoint{0.173cm}{1.258cm}}{\pgfqpoint{0.207cm}{1.273cm}}{\pgfqpoint{0.233cm}{1.299cm}}
\pgfpathcurveto{\pgfqpoint{0.259cm}{1.324cm}}{\pgfqpoint{0.273cm}{1.359cm}}{\pgfqpoint{0.273cm}{1.395cm}}
\pgfusepath{fill}
\begin{pgfscope}
\pgfsetdash{}{0cm}
\pgfsetlinewidth{0.818mm}
\pgfsetmiterlimit{7.0}
\pgfpathmoveto{\pgfqpoint{0.682cm}{0.671cm}}
\pgfpathlineto{\pgfqpoint{0.679cm}{1.418cm}}
\pgfusepath{stroke}
\end{pgfscope}
\pgfpathmoveto{\pgfqpoint{0.815cm}{1.399cm}}
\pgfpathcurveto{\pgfqpoint{0.815cm}{1.435cm}}{\pgfqpoint{0.801cm}{1.47cm}}{\pgfqpoint{0.775cm}{1.496cm}}
\pgfpathcurveto{\pgfqpoint{0.75cm}{1.521cm}}{\pgfqpoint{0.715cm}{1.536cm}}{\pgfqpoint{0.679cm}{1.536cm}}
\pgfpathcurveto{\pgfqpoint{0.643cm}{1.536cm}}{\pgfqpoint{0.608cm}{1.521cm}}{\pgfqpoint{0.582cm}{1.496cm}}
\pgfpathcurveto{\pgfqpoint{0.557cm}{1.47cm}}{\pgfqpoint{0.542cm}{1.435cm}}{\pgfqpoint{0.542cm}{1.399cm}}
\pgfpathcurveto{\pgfqpoint{0.542cm}{1.363cm}}{\pgfqpoint{0.557cm}{1.328cm}}{\pgfqpoint{0.582cm}{1.302cm}}
\pgfpathcurveto{\pgfqpoint{0.608cm}{1.276cm}}{\pgfqpoint{0.643cm}{1.262cm}}{\pgfqpoint{0.679cm}{1.262cm}}
\pgfpathcurveto{\pgfqpoint{0.715cm}{1.262cm}}{\pgfqpoint{0.75cm}{1.276cm}}{\pgfqpoint{0.775cm}{1.302cm}}
\pgfpathcurveto{\pgfqpoint{0.801cm}{1.328cm}}{\pgfqpoint{0.815cm}{1.363cm}}{\pgfqpoint{0.815cm}{1.399cm}}
\pgfusepath{fill}
\pgfpathmoveto{\pgfqpoint{1.345cm}{1.371cm}}
\pgfpathcurveto{\pgfqpoint{1.345cm}{1.408cm}}{\pgfqpoint{1.331cm}{1.442cm}}{\pgfqpoint{1.305cm}{1.468cm}}
\pgfpathcurveto{\pgfqpoint{1.28cm}{1.494cm}}{\pgfqpoint{1.245cm}{1.508cm}}{\pgfqpoint{1.209cm}{1.508cm}}
\pgfpathcurveto{\pgfqpoint{1.172cm}{1.508cm}}{\pgfqpoint{1.138cm}{1.494cm}}{\pgfqpoint{1.112cm}{1.468cm}}
\pgfpathcurveto{\pgfqpoint{1.087cm}{1.442cm}}{\pgfqpoint{1.072cm}{1.408cm}}{\pgfqpoint{1.072cm}{1.371cm}}
\pgfpathcurveto{\pgfqpoint{1.072cm}{1.335cm}}{\pgfqpoint{1.087cm}{1.3cm}}{\pgfqpoint{1.112cm}{1.274cm}}
\pgfpathcurveto{\pgfqpoint{1.138cm}{1.249cm}}{\pgfqpoint{1.172cm}{1.234cm}}{\pgfqpoint{1.209cm}{1.234cm}}
\pgfpathcurveto{\pgfqpoint{1.245cm}{1.234cm}}{\pgfqpoint{1.28cm}{1.249cm}}{\pgfqpoint{1.305cm}{1.274cm}}
\pgfpathcurveto{\pgfqpoint{1.331cm}{1.3cm}}{\pgfqpoint{1.345cm}{1.335cm}}{\pgfqpoint{1.345cm}{1.371cm}}
\pgfusepath{fill}
\begin{pgfscope}
\pgfsetdash{}{0cm}
\pgfsetlinewidth{0.818mm}
\pgfsetroundcap
\pgfsetmiterlimit{4.0}
\pgfpathmoveto{\pgfqpoint{0.682cm}{0.671cm}}
\pgfpathlineto{\pgfqpoint{0.682cm}{0.042cm}}
\pgfusepath{stroke}
\end{pgfscope}
\end{pgfscope}
\end{pgfscope}
\end{pgfscope}
\end{tikzpicture}}}+\phi)^2\psi+3(-X^{\!\resizebox{0.6em}{!}{
\begin{tikzpicture}
\pgfpathmoveto{\pgfqpoint{0cm}{-0.035cm}}
\pgfpathlineto{\pgfqpoint{1.376cm}{-0.035cm}}
\pgfpathlineto{\pgfqpoint{1.376cm}{1.552cm}}
\pgfpathlineto{\pgfqpoint{0cm}{1.552cm}}
\pgfpathclose
\pgfusepath{clip}
\begin{pgfscope}
\begin{pgfscope}
\pgfpathmoveto{\pgfqpoint{0cm}{-0.035cm}}
\pgfpathlineto{\pgfqpoint{1.376cm}{-0.035cm}}
\pgfpathlineto{\pgfqpoint{1.376cm}{1.552cm}}
\pgfpathlineto{\pgfqpoint{0cm}{1.552cm}}
\pgfpathclose
\pgfusepath{clip}
\begin{pgfscope}
\begin{pgfscope}
\pgfsetdash{}{0cm}
\pgfsetlinewidth{0.818mm}
\pgfsetroundcap
\pgfsetroundjoin
\pgfsetmiterlimit{7.0}
\definecolor{eps2pgf_color}{gray}{0}\pgfsetstrokecolor{eps2pgf_color}\pgfsetfillcolor{eps2pgf_color}
\pgfpathmoveto{\pgfqpoint{0.117cm}{1.421cm}}
\pgfpathlineto{\pgfqpoint{0.682cm}{0.671cm}}
\pgfpathlineto{\pgfqpoint{1.246cm}{1.421cm}}
\pgfusepath{stroke}
\end{pgfscope}
\definecolor{eps2pgf_color}{gray}{0}\pgfsetstrokecolor{eps2pgf_color}\pgfsetfillcolor{eps2pgf_color}
\pgfpathmoveto{\pgfqpoint{0.273cm}{1.395cm}}
\pgfpathcurveto{\pgfqpoint{0.273cm}{1.432cm}}{\pgfqpoint{0.259cm}{1.467cm}}{\pgfqpoint{0.233cm}{1.492cm}}
\pgfpathcurveto{\pgfqpoint{0.207cm}{1.518cm}}{\pgfqpoint{0.173cm}{1.532cm}}{\pgfqpoint{0.137cm}{1.532cm}}
\pgfpathcurveto{\pgfqpoint{0.1cm}{1.532cm}}{\pgfqpoint{0.066cm}{1.518cm}}{\pgfqpoint{0.04cm}{1.492cm}}
\pgfpathcurveto{\pgfqpoint{0.014cm}{1.467cm}}{\pgfqpoint{0cm}{1.432cm}}{\pgfqpoint{0cm}{1.395cm}}
\pgfpathcurveto{\pgfqpoint{0cm}{1.359cm}}{\pgfqpoint{0.014cm}{1.324cm}}{\pgfqpoint{0.04cm}{1.299cm}}
\pgfpathcurveto{\pgfqpoint{0.066cm}{1.273cm}}{\pgfqpoint{0.1cm}{1.258cm}}{\pgfqpoint{0.137cm}{1.258cm}}
\pgfpathcurveto{\pgfqpoint{0.173cm}{1.258cm}}{\pgfqpoint{0.207cm}{1.273cm}}{\pgfqpoint{0.233cm}{1.299cm}}
\pgfpathcurveto{\pgfqpoint{0.259cm}{1.324cm}}{\pgfqpoint{0.273cm}{1.359cm}}{\pgfqpoint{0.273cm}{1.395cm}}
\pgfusepath{fill}
\begin{pgfscope}
\pgfsetdash{}{0cm}
\pgfsetlinewidth{0.818mm}
\pgfsetmiterlimit{7.0}
\pgfpathmoveto{\pgfqpoint{0.682cm}{0.671cm}}
\pgfpathlineto{\pgfqpoint{0.679cm}{1.418cm}}
\pgfusepath{stroke}
\end{pgfscope}
\pgfpathmoveto{\pgfqpoint{0.815cm}{1.399cm}}
\pgfpathcurveto{\pgfqpoint{0.815cm}{1.435cm}}{\pgfqpoint{0.801cm}{1.47cm}}{\pgfqpoint{0.775cm}{1.496cm}}
\pgfpathcurveto{\pgfqpoint{0.75cm}{1.521cm}}{\pgfqpoint{0.715cm}{1.536cm}}{\pgfqpoint{0.679cm}{1.536cm}}
\pgfpathcurveto{\pgfqpoint{0.643cm}{1.536cm}}{\pgfqpoint{0.608cm}{1.521cm}}{\pgfqpoint{0.582cm}{1.496cm}}
\pgfpathcurveto{\pgfqpoint{0.557cm}{1.47cm}}{\pgfqpoint{0.542cm}{1.435cm}}{\pgfqpoint{0.542cm}{1.399cm}}
\pgfpathcurveto{\pgfqpoint{0.542cm}{1.363cm}}{\pgfqpoint{0.557cm}{1.328cm}}{\pgfqpoint{0.582cm}{1.302cm}}
\pgfpathcurveto{\pgfqpoint{0.608cm}{1.276cm}}{\pgfqpoint{0.643cm}{1.262cm}}{\pgfqpoint{0.679cm}{1.262cm}}
\pgfpathcurveto{\pgfqpoint{0.715cm}{1.262cm}}{\pgfqpoint{0.75cm}{1.276cm}}{\pgfqpoint{0.775cm}{1.302cm}}
\pgfpathcurveto{\pgfqpoint{0.801cm}{1.328cm}}{\pgfqpoint{0.815cm}{1.363cm}}{\pgfqpoint{0.815cm}{1.399cm}}
\pgfusepath{fill}
\pgfpathmoveto{\pgfqpoint{1.345cm}{1.371cm}}
\pgfpathcurveto{\pgfqpoint{1.345cm}{1.408cm}}{\pgfqpoint{1.331cm}{1.442cm}}{\pgfqpoint{1.305cm}{1.468cm}}
\pgfpathcurveto{\pgfqpoint{1.28cm}{1.494cm}}{\pgfqpoint{1.245cm}{1.508cm}}{\pgfqpoint{1.209cm}{1.508cm}}
\pgfpathcurveto{\pgfqpoint{1.172cm}{1.508cm}}{\pgfqpoint{1.138cm}{1.494cm}}{\pgfqpoint{1.112cm}{1.468cm}}
\pgfpathcurveto{\pgfqpoint{1.087cm}{1.442cm}}{\pgfqpoint{1.072cm}{1.408cm}}{\pgfqpoint{1.072cm}{1.371cm}}
\pgfpathcurveto{\pgfqpoint{1.072cm}{1.335cm}}{\pgfqpoint{1.087cm}{1.3cm}}{\pgfqpoint{1.112cm}{1.274cm}}
\pgfpathcurveto{\pgfqpoint{1.138cm}{1.249cm}}{\pgfqpoint{1.172cm}{1.234cm}}{\pgfqpoint{1.209cm}{1.234cm}}
\pgfpathcurveto{\pgfqpoint{1.245cm}{1.234cm}}{\pgfqpoint{1.28cm}{1.249cm}}{\pgfqpoint{1.305cm}{1.274cm}}
\pgfpathcurveto{\pgfqpoint{1.331cm}{1.3cm}}{\pgfqpoint{1.345cm}{1.335cm}}{\pgfqpoint{1.345cm}{1.371cm}}
\pgfusepath{fill}
\begin{pgfscope}
\pgfsetdash{}{0cm}
\pgfsetlinewidth{0.818mm}
\pgfsetroundcap
\pgfsetmiterlimit{4.0}
\pgfpathmoveto{\pgfqpoint{0.682cm}{0.671cm}}
\pgfpathlineto{\pgfqpoint{0.682cm}{0.042cm}}
\pgfusepath{stroke}
\end{pgfscope}
\end{pgfscope}
\end{pgfscope}
\end{pgfscope}
\end{tikzpicture}}}+\phi)\psi^2\|_{\CC^\gamma(\rho^{3+\gamma})}\\
&\quad\lesssim (1+\|\psi\|_{L^\infty(\rho)})(1+\|\psi\|_{\CC^\gamma(\rho^{1+\gamma})}).
\end{align*}
Next, we observe that due to our choice of $K$ at the end of Section \ref{s:phi}, it holds that $$2^{(1+\gamma+\kappa)K}\simeq 1+\|\psi\|_{L^\infty(\rho)}^{1+\varepsilon},\quad 2^{(\gamma+\kappa)K/2}\simeq 1+\|\psi\|_{L^\infty(\rho)}^\varepsilon,$$
$$
2^{(\frac{1}{2}+\gamma+\kappa)\frac{2}{3}K}\simeq 1+\|\psi\|_{L^\infty(\rho)}^{\varepsilon}, \quad 2^{(\frac{1}{2}+\gamma+\kappa)\frac{4}{3}K}\simeq 1+\|\psi\|_{L^\infty(\rho)}^{\varepsilon}
$$
 for some $\varepsilon\in(0,1)$ (whose value possibly changes from bound to bound); and in view of Table~\ref{t:loc} we have
 $$
\|\UU_\leqslant\llbracket X^2 \rrbracket\|_{\CC^\gamma(\rho^{2})}\lesssim 2^{(1+\gamma+\kappa)K}\|\llbracket X^2 \rrbracket\|_{\CC^{-1-\kappa}(\rho^{\sigma})},
$$
\begin{align*}
\|\UU_\leqslant X^{\!\resizebox{!}{.8em}{
\begin{tikzpicture}
\pgfpathmoveto{\pgfqpoint{0cm}{-0.035cm}}
\pgfpathlineto{\pgfqpoint{1.976cm}{-0.035cm}}
\pgfpathlineto{\pgfqpoint{1.976cm}{1.94cm}}
\pgfpathlineto{\pgfqpoint{0cm}{1.94cm}}
\pgfpathclose
\pgfusepath{clip}
\begin{pgfscope}
\begin{pgfscope}
\pgfpathmoveto{\pgfqpoint{0cm}{-0.035cm}}
\pgfpathlineto{\pgfqpoint{1.976cm}{-0.035cm}}
\pgfpathlineto{\pgfqpoint{1.976cm}{1.94cm}}
\pgfpathlineto{\pgfqpoint{0cm}{1.94cm}}
\pgfpathclose
\pgfusepath{clip}
\begin{pgfscope}
\begin{pgfscope}
\pgfsetdash{}{0cm}
\pgfsetlinewidth{0.818mm}
\pgfsetroundcap
\pgfsetroundjoin
\pgfsetmiterlimit{7.0}
\definecolor{eps2pgf_color}{gray}{0}\pgfsetstrokecolor{eps2pgf_color}\pgfsetfillcolor{eps2pgf_color}
\pgfpathmoveto{\pgfqpoint{0.117cm}{1.815cm}}
\pgfpathlineto{\pgfqpoint{0.682cm}{1.065cm}}
\pgfpathlineto{\pgfqpoint{1.246cm}{1.815cm}}
\pgfusepath{stroke}
\end{pgfscope}
\definecolor{eps2pgf_color}{gray}{0}\pgfsetstrokecolor{eps2pgf_color}\pgfsetfillcolor{eps2pgf_color}
\pgfpathmoveto{\pgfqpoint{0.273cm}{1.789cm}}
\pgfpathcurveto{\pgfqpoint{0.273cm}{1.825cm}}{\pgfqpoint{0.259cm}{1.86cm}}{\pgfqpoint{0.233cm}{1.886cm}}
\pgfpathcurveto{\pgfqpoint{0.207cm}{1.912cm}}{\pgfqpoint{0.173cm}{1.926cm}}{\pgfqpoint{0.137cm}{1.926cm}}
\pgfpathcurveto{\pgfqpoint{0.1cm}{1.926cm}}{\pgfqpoint{0.066cm}{1.912cm}}{\pgfqpoint{0.04cm}{1.886cm}}
\pgfpathcurveto{\pgfqpoint{0.014cm}{1.86cm}}{\pgfqpoint{0cm}{1.825cm}}{\pgfqpoint{0cm}{1.789cm}}
\pgfpathcurveto{\pgfqpoint{0cm}{1.753cm}}{\pgfqpoint{0.014cm}{1.718cm}}{\pgfqpoint{0.04cm}{1.692cm}}
\pgfpathcurveto{\pgfqpoint{0.066cm}{1.667cm}}{\pgfqpoint{0.1cm}{1.652cm}}{\pgfqpoint{0.137cm}{1.652cm}}
\pgfpathcurveto{\pgfqpoint{0.173cm}{1.652cm}}{\pgfqpoint{0.207cm}{1.667cm}}{\pgfqpoint{0.233cm}{1.692cm}}
\pgfpathcurveto{\pgfqpoint{0.259cm}{1.718cm}}{\pgfqpoint{0.273cm}{1.753cm}}{\pgfqpoint{0.273cm}{1.789cm}}
\pgfusepath{fill}
\pgfpathmoveto{\pgfqpoint{1.345cm}{1.765cm}}
\pgfpathcurveto{\pgfqpoint{1.345cm}{1.801cm}}{\pgfqpoint{1.331cm}{1.836cm}}{\pgfqpoint{1.305cm}{1.862cm}}
\pgfpathcurveto{\pgfqpoint{1.28cm}{1.887cm}}{\pgfqpoint{1.245cm}{1.902cm}}{\pgfqpoint{1.209cm}{1.902cm}}
\pgfpathcurveto{\pgfqpoint{1.172cm}{1.902cm}}{\pgfqpoint{1.138cm}{1.887cm}}{\pgfqpoint{1.112cm}{1.862cm}}
\pgfpathcurveto{\pgfqpoint{1.087cm}{1.836cm}}{\pgfqpoint{1.072cm}{1.801cm}}{\pgfqpoint{1.072cm}{1.765cm}}
\pgfpathcurveto{\pgfqpoint{1.072cm}{1.728cm}}{\pgfqpoint{1.087cm}{1.694cm}}{\pgfqpoint{1.112cm}{1.668cm}}
\pgfpathcurveto{\pgfqpoint{1.138cm}{1.642cm}}{\pgfqpoint{1.172cm}{1.628cm}}{\pgfqpoint{1.209cm}{1.628cm}}
\pgfpathcurveto{\pgfqpoint{1.245cm}{1.628cm}}{\pgfqpoint{1.28cm}{1.642cm}}{\pgfqpoint{1.305cm}{1.668cm}}
\pgfpathcurveto{\pgfqpoint{1.331cm}{1.694cm}}{\pgfqpoint{1.345cm}{1.728cm}}{\pgfqpoint{1.345cm}{1.765cm}}
\pgfusepath{fill}
\begin{pgfscope}
\pgfsetdash{}{0cm}
\pgfsetlinewidth{0.818mm}
\pgfsetroundcap
\pgfsetroundjoin
\pgfsetmiterlimit{7.0}
\pgfpathmoveto{\pgfqpoint{0.682cm}{1.065cm}}
\pgfpathlineto{\pgfqpoint{1.246cm}{0.315cm}}
\pgfpathlineto{\pgfqpoint{1.811cm}{1.065cm}}
\pgfusepath{stroke}
\end{pgfscope}
\pgfpathmoveto{\pgfqpoint{1.948cm}{1.065cm}}
\pgfpathcurveto{\pgfqpoint{1.948cm}{1.101cm}}{\pgfqpoint{1.933cm}{1.136cm}}{\pgfqpoint{1.907cm}{1.162cm}}
\pgfpathcurveto{\pgfqpoint{1.882cm}{1.187cm}}{\pgfqpoint{1.847cm}{1.202cm}}{\pgfqpoint{1.811cm}{1.202cm}}
\pgfpathcurveto{\pgfqpoint{1.775cm}{1.202cm}}{\pgfqpoint{1.74cm}{1.187cm}}{\pgfqpoint{1.714cm}{1.162cm}}
\pgfpathcurveto{\pgfqpoint{1.689cm}{1.136cm}}{\pgfqpoint{1.674cm}{1.101cm}}{\pgfqpoint{1.674cm}{1.065cm}}
\pgfpathcurveto{\pgfqpoint{1.674cm}{1.029cm}}{\pgfqpoint{1.689cm}{0.994cm}}{\pgfqpoint{1.714cm}{0.968cm}}
\pgfpathcurveto{\pgfqpoint{1.74cm}{0.942cm}}{\pgfqpoint{1.775cm}{0.928cm}}{\pgfqpoint{1.811cm}{0.928cm}}
\pgfpathcurveto{\pgfqpoint{1.847cm}{0.928cm}}{\pgfqpoint{1.882cm}{0.942cm}}{\pgfqpoint{1.907cm}{0.968cm}}
\pgfpathcurveto{\pgfqpoint{1.933cm}{0.994cm}}{\pgfqpoint{1.948cm}{1.029cm}}{\pgfqpoint{1.948cm}{1.065cm}}
\pgfusepath{fill}
\begin{pgfscope}
\pgfsetdash{}{0cm}
\pgfsetlinewidth{0.818mm}
\pgfsetmiterlimit{7.0}
\pgfpathmoveto{\pgfqpoint{1.246cm}{0.315cm}}
\pgfpathlineto{\pgfqpoint{1.244cm}{1.061cm}}
\pgfusepath{stroke}
\end{pgfscope}
\pgfpathmoveto{\pgfqpoint{1.38cm}{1.065cm}}
\pgfpathcurveto{\pgfqpoint{1.38cm}{1.101cm}}{\pgfqpoint{1.366cm}{1.136cm}}{\pgfqpoint{1.34cm}{1.162cm}}
\pgfpathcurveto{\pgfqpoint{1.315cm}{1.187cm}}{\pgfqpoint{1.28cm}{1.202cm}}{\pgfqpoint{1.244cm}{1.202cm}}
\pgfpathcurveto{\pgfqpoint{1.207cm}{1.202cm}}{\pgfqpoint{1.173cm}{1.187cm}}{\pgfqpoint{1.147cm}{1.162cm}}
\pgfpathcurveto{\pgfqpoint{1.121cm}{1.136cm}}{\pgfqpoint{1.107cm}{1.101cm}}{\pgfqpoint{1.107cm}{1.065cm}}
\pgfpathcurveto{\pgfqpoint{1.107cm}{1.029cm}}{\pgfqpoint{1.121cm}{0.994cm}}{\pgfqpoint{1.147cm}{0.968cm}}
\pgfpathcurveto{\pgfqpoint{1.173cm}{0.942cm}}{\pgfqpoint{1.207cm}{0.928cm}}{\pgfqpoint{1.244cm}{0.928cm}}
\pgfpathcurveto{\pgfqpoint{1.28cm}{0.928cm}}{\pgfqpoint{1.315cm}{0.942cm}}{\pgfqpoint{1.34cm}{0.968cm}}
\pgfpathcurveto{\pgfqpoint{1.366cm}{0.994cm}}{\pgfqpoint{1.38cm}{1.029cm}}{\pgfqpoint{1.38cm}{1.065cm}}
\pgfusepath{fill}
\begin{pgfscope}
\pgfsetdash{}{0cm}
\pgfsetlinewidth{0.818mm}
\pgfsetmiterlimit{4.0}
\pgfpathmoveto{\pgfqpoint{1.383cm}{0.178cm}}
\pgfpathcurveto{\pgfqpoint{1.383cm}{0.214cm}}{\pgfqpoint{1.369cm}{0.249cm}}{\pgfqpoint{1.343cm}{0.275cm}}
\pgfpathcurveto{\pgfqpoint{1.317cm}{0.3cm}}{\pgfqpoint{1.283cm}{0.315cm}}{\pgfqpoint{1.246cm}{0.315cm}}
\pgfpathcurveto{\pgfqpoint{1.21cm}{0.315cm}}{\pgfqpoint{1.175cm}{0.3cm}}{\pgfqpoint{1.15cm}{0.275cm}}
\pgfpathcurveto{\pgfqpoint{1.124cm}{0.249cm}}{\pgfqpoint{1.11cm}{0.214cm}}{\pgfqpoint{1.11cm}{0.178cm}}
\pgfpathcurveto{\pgfqpoint{1.11cm}{0.141cm}}{\pgfqpoint{1.124cm}{0.107cm}}{\pgfqpoint{1.15cm}{0.081cm}}
\pgfpathcurveto{\pgfqpoint{1.175cm}{0.055cm}}{\pgfqpoint{1.21cm}{0.041cm}}{\pgfqpoint{1.246cm}{0.041cm}}
\pgfpathcurveto{\pgfqpoint{1.283cm}{0.041cm}}{\pgfqpoint{1.317cm}{0.055cm}}{\pgfqpoint{1.343cm}{0.081cm}}
\pgfpathcurveto{\pgfqpoint{1.369cm}{0.107cm}}{\pgfqpoint{1.383cm}{0.141cm}}{\pgfqpoint{1.383cm}{0.178cm}}
\pgfusepath{stroke}
\end{pgfscope}
\end{pgfscope}
\end{pgfscope}
\end{pgfscope}
\end{tikzpicture}}}\|_{\CC^{\gamma}(\rho^{2+\gamma})}\lesssim 2^{(\gamma+\kappa)K/2}\|X^{\!\resizebox{!}{.8em}{
\begin{tikzpicture}
\pgfpathmoveto{\pgfqpoint{0cm}{-0.035cm}}
\pgfpathlineto{\pgfqpoint{1.976cm}{-0.035cm}}
\pgfpathlineto{\pgfqpoint{1.976cm}{1.94cm}}
\pgfpathlineto{\pgfqpoint{0cm}{1.94cm}}
\pgfpathclose
\pgfusepath{clip}
\begin{pgfscope}
\begin{pgfscope}
\pgfpathmoveto{\pgfqpoint{0cm}{-0.035cm}}
\pgfpathlineto{\pgfqpoint{1.976cm}{-0.035cm}}
\pgfpathlineto{\pgfqpoint{1.976cm}{1.94cm}}
\pgfpathlineto{\pgfqpoint{0cm}{1.94cm}}
\pgfpathclose
\pgfusepath{clip}
\begin{pgfscope}
\begin{pgfscope}
\pgfsetdash{}{0cm}
\pgfsetlinewidth{0.818mm}
\pgfsetroundcap
\pgfsetroundjoin
\pgfsetmiterlimit{7.0}
\definecolor{eps2pgf_color}{gray}{0}\pgfsetstrokecolor{eps2pgf_color}\pgfsetfillcolor{eps2pgf_color}
\pgfpathmoveto{\pgfqpoint{0.117cm}{1.815cm}}
\pgfpathlineto{\pgfqpoint{0.682cm}{1.065cm}}
\pgfpathlineto{\pgfqpoint{1.246cm}{1.815cm}}
\pgfusepath{stroke}
\end{pgfscope}
\definecolor{eps2pgf_color}{gray}{0}\pgfsetstrokecolor{eps2pgf_color}\pgfsetfillcolor{eps2pgf_color}
\pgfpathmoveto{\pgfqpoint{0.273cm}{1.789cm}}
\pgfpathcurveto{\pgfqpoint{0.273cm}{1.825cm}}{\pgfqpoint{0.259cm}{1.86cm}}{\pgfqpoint{0.233cm}{1.886cm}}
\pgfpathcurveto{\pgfqpoint{0.207cm}{1.912cm}}{\pgfqpoint{0.173cm}{1.926cm}}{\pgfqpoint{0.137cm}{1.926cm}}
\pgfpathcurveto{\pgfqpoint{0.1cm}{1.926cm}}{\pgfqpoint{0.066cm}{1.912cm}}{\pgfqpoint{0.04cm}{1.886cm}}
\pgfpathcurveto{\pgfqpoint{0.014cm}{1.86cm}}{\pgfqpoint{0cm}{1.825cm}}{\pgfqpoint{0cm}{1.789cm}}
\pgfpathcurveto{\pgfqpoint{0cm}{1.753cm}}{\pgfqpoint{0.014cm}{1.718cm}}{\pgfqpoint{0.04cm}{1.692cm}}
\pgfpathcurveto{\pgfqpoint{0.066cm}{1.667cm}}{\pgfqpoint{0.1cm}{1.652cm}}{\pgfqpoint{0.137cm}{1.652cm}}
\pgfpathcurveto{\pgfqpoint{0.173cm}{1.652cm}}{\pgfqpoint{0.207cm}{1.667cm}}{\pgfqpoint{0.233cm}{1.692cm}}
\pgfpathcurveto{\pgfqpoint{0.259cm}{1.718cm}}{\pgfqpoint{0.273cm}{1.753cm}}{\pgfqpoint{0.273cm}{1.789cm}}
\pgfusepath{fill}
\pgfpathmoveto{\pgfqpoint{1.345cm}{1.765cm}}
\pgfpathcurveto{\pgfqpoint{1.345cm}{1.801cm}}{\pgfqpoint{1.331cm}{1.836cm}}{\pgfqpoint{1.305cm}{1.862cm}}
\pgfpathcurveto{\pgfqpoint{1.28cm}{1.887cm}}{\pgfqpoint{1.245cm}{1.902cm}}{\pgfqpoint{1.209cm}{1.902cm}}
\pgfpathcurveto{\pgfqpoint{1.172cm}{1.902cm}}{\pgfqpoint{1.138cm}{1.887cm}}{\pgfqpoint{1.112cm}{1.862cm}}
\pgfpathcurveto{\pgfqpoint{1.087cm}{1.836cm}}{\pgfqpoint{1.072cm}{1.801cm}}{\pgfqpoint{1.072cm}{1.765cm}}
\pgfpathcurveto{\pgfqpoint{1.072cm}{1.728cm}}{\pgfqpoint{1.087cm}{1.694cm}}{\pgfqpoint{1.112cm}{1.668cm}}
\pgfpathcurveto{\pgfqpoint{1.138cm}{1.642cm}}{\pgfqpoint{1.172cm}{1.628cm}}{\pgfqpoint{1.209cm}{1.628cm}}
\pgfpathcurveto{\pgfqpoint{1.245cm}{1.628cm}}{\pgfqpoint{1.28cm}{1.642cm}}{\pgfqpoint{1.305cm}{1.668cm}}
\pgfpathcurveto{\pgfqpoint{1.331cm}{1.694cm}}{\pgfqpoint{1.345cm}{1.728cm}}{\pgfqpoint{1.345cm}{1.765cm}}
\pgfusepath{fill}
\begin{pgfscope}
\pgfsetdash{}{0cm}
\pgfsetlinewidth{0.818mm}
\pgfsetroundcap
\pgfsetroundjoin
\pgfsetmiterlimit{7.0}
\pgfpathmoveto{\pgfqpoint{0.682cm}{1.065cm}}
\pgfpathlineto{\pgfqpoint{1.246cm}{0.315cm}}
\pgfpathlineto{\pgfqpoint{1.811cm}{1.065cm}}
\pgfusepath{stroke}
\end{pgfscope}
\pgfpathmoveto{\pgfqpoint{1.948cm}{1.065cm}}
\pgfpathcurveto{\pgfqpoint{1.948cm}{1.101cm}}{\pgfqpoint{1.933cm}{1.136cm}}{\pgfqpoint{1.907cm}{1.162cm}}
\pgfpathcurveto{\pgfqpoint{1.882cm}{1.187cm}}{\pgfqpoint{1.847cm}{1.202cm}}{\pgfqpoint{1.811cm}{1.202cm}}
\pgfpathcurveto{\pgfqpoint{1.775cm}{1.202cm}}{\pgfqpoint{1.74cm}{1.187cm}}{\pgfqpoint{1.714cm}{1.162cm}}
\pgfpathcurveto{\pgfqpoint{1.689cm}{1.136cm}}{\pgfqpoint{1.674cm}{1.101cm}}{\pgfqpoint{1.674cm}{1.065cm}}
\pgfpathcurveto{\pgfqpoint{1.674cm}{1.029cm}}{\pgfqpoint{1.689cm}{0.994cm}}{\pgfqpoint{1.714cm}{0.968cm}}
\pgfpathcurveto{\pgfqpoint{1.74cm}{0.942cm}}{\pgfqpoint{1.775cm}{0.928cm}}{\pgfqpoint{1.811cm}{0.928cm}}
\pgfpathcurveto{\pgfqpoint{1.847cm}{0.928cm}}{\pgfqpoint{1.882cm}{0.942cm}}{\pgfqpoint{1.907cm}{0.968cm}}
\pgfpathcurveto{\pgfqpoint{1.933cm}{0.994cm}}{\pgfqpoint{1.948cm}{1.029cm}}{\pgfqpoint{1.948cm}{1.065cm}}
\pgfusepath{fill}
\begin{pgfscope}
\pgfsetdash{}{0cm}
\pgfsetlinewidth{0.818mm}
\pgfsetmiterlimit{7.0}
\pgfpathmoveto{\pgfqpoint{1.246cm}{0.315cm}}
\pgfpathlineto{\pgfqpoint{1.244cm}{1.061cm}}
\pgfusepath{stroke}
\end{pgfscope}
\pgfpathmoveto{\pgfqpoint{1.38cm}{1.065cm}}
\pgfpathcurveto{\pgfqpoint{1.38cm}{1.101cm}}{\pgfqpoint{1.366cm}{1.136cm}}{\pgfqpoint{1.34cm}{1.162cm}}
\pgfpathcurveto{\pgfqpoint{1.315cm}{1.187cm}}{\pgfqpoint{1.28cm}{1.202cm}}{\pgfqpoint{1.244cm}{1.202cm}}
\pgfpathcurveto{\pgfqpoint{1.207cm}{1.202cm}}{\pgfqpoint{1.173cm}{1.187cm}}{\pgfqpoint{1.147cm}{1.162cm}}
\pgfpathcurveto{\pgfqpoint{1.121cm}{1.136cm}}{\pgfqpoint{1.107cm}{1.101cm}}{\pgfqpoint{1.107cm}{1.065cm}}
\pgfpathcurveto{\pgfqpoint{1.107cm}{1.029cm}}{\pgfqpoint{1.121cm}{0.994cm}}{\pgfqpoint{1.147cm}{0.968cm}}
\pgfpathcurveto{\pgfqpoint{1.173cm}{0.942cm}}{\pgfqpoint{1.207cm}{0.928cm}}{\pgfqpoint{1.244cm}{0.928cm}}
\pgfpathcurveto{\pgfqpoint{1.28cm}{0.928cm}}{\pgfqpoint{1.315cm}{0.942cm}}{\pgfqpoint{1.34cm}{0.968cm}}
\pgfpathcurveto{\pgfqpoint{1.366cm}{0.994cm}}{\pgfqpoint{1.38cm}{1.029cm}}{\pgfqpoint{1.38cm}{1.065cm}}
\pgfusepath{fill}
\begin{pgfscope}
\pgfsetdash{}{0cm}
\pgfsetlinewidth{0.818mm}
\pgfsetmiterlimit{4.0}
\pgfpathmoveto{\pgfqpoint{1.383cm}{0.178cm}}
\pgfpathcurveto{\pgfqpoint{1.383cm}{0.214cm}}{\pgfqpoint{1.369cm}{0.249cm}}{\pgfqpoint{1.343cm}{0.275cm}}
\pgfpathcurveto{\pgfqpoint{1.317cm}{0.3cm}}{\pgfqpoint{1.283cm}{0.315cm}}{\pgfqpoint{1.246cm}{0.315cm}}
\pgfpathcurveto{\pgfqpoint{1.21cm}{0.315cm}}{\pgfqpoint{1.175cm}{0.3cm}}{\pgfqpoint{1.15cm}{0.275cm}}
\pgfpathcurveto{\pgfqpoint{1.124cm}{0.249cm}}{\pgfqpoint{1.11cm}{0.214cm}}{\pgfqpoint{1.11cm}{0.178cm}}
\pgfpathcurveto{\pgfqpoint{1.11cm}{0.141cm}}{\pgfqpoint{1.124cm}{0.107cm}}{\pgfqpoint{1.15cm}{0.081cm}}
\pgfpathcurveto{\pgfqpoint{1.175cm}{0.055cm}}{\pgfqpoint{1.21cm}{0.041cm}}{\pgfqpoint{1.246cm}{0.041cm}}
\pgfpathcurveto{\pgfqpoint{1.283cm}{0.041cm}}{\pgfqpoint{1.317cm}{0.055cm}}{\pgfqpoint{1.343cm}{0.081cm}}
\pgfpathcurveto{\pgfqpoint{1.369cm}{0.107cm}}{\pgfqpoint{1.383cm}{0.141cm}}{\pgfqpoint{1.383cm}{0.178cm}}
\pgfusepath{stroke}
\end{pgfscope}
\end{pgfscope}
\end{pgfscope}
\end{pgfscope}
\end{tikzpicture}}}\|_{\CC^{-\kappa}(\rho^\sigma)},
\end{align*}
\begin{align*}
\|\UU_\leqslant X\|_{\CC^\gamma(\rho^{2})}\lesssim 2^{(\frac{1}{2}+\gamma+\kappa)\frac{2}{3}K} \|X\|_{\CC^{-\frac{1}{2}-\kappa}(\rho^\sigma)},
\end{align*}
$$
\|\UU_{\leq}X\|_{\CC^{-\frac{1}{2}-\kappa}(\rho^{\sigma})}\lesssim \|X\|_{\CC^{-\frac{1}{2}-\kappa}(\rho^{\sigma})},
$$
\begin{align*}
\|\UU_\leqslant X^{\!\resizebox{!}{.8em}{
\begin{tikzpicture}
\pgfpathmoveto{\pgfqpoint{0cm}{-0.035cm}}
\pgfpathlineto{\pgfqpoint{1.976cm}{-0.035cm}}
\pgfpathlineto{\pgfqpoint{1.976cm}{1.94cm}}
\pgfpathlineto{\pgfqpoint{0cm}{1.94cm}}
\pgfpathclose
\pgfusepath{clip}
\begin{pgfscope}
\begin{pgfscope}
\pgfpathmoveto{\pgfqpoint{0cm}{-0.035cm}}
\pgfpathlineto{\pgfqpoint{1.976cm}{-0.035cm}}
\pgfpathlineto{\pgfqpoint{1.976cm}{1.94cm}}
\pgfpathlineto{\pgfqpoint{0cm}{1.94cm}}
\pgfpathclose
\pgfusepath{clip}
\begin{pgfscope}
\begin{pgfscope}
\pgfsetdash{}{0cm}
\pgfsetlinewidth{0.818mm}
\pgfsetroundcap
\pgfsetroundjoin
\pgfsetmiterlimit{7.0}
\definecolor{eps2pgf_color}{gray}{0}\pgfsetstrokecolor{eps2pgf_color}\pgfsetfillcolor{eps2pgf_color}
\pgfpathmoveto{\pgfqpoint{0.117cm}{1.815cm}}
\pgfpathlineto{\pgfqpoint{0.682cm}{1.065cm}}
\pgfpathlineto{\pgfqpoint{1.246cm}{1.815cm}}
\pgfusepath{stroke}
\end{pgfscope}
\definecolor{eps2pgf_color}{gray}{0}\pgfsetstrokecolor{eps2pgf_color}\pgfsetfillcolor{eps2pgf_color}
\pgfpathmoveto{\pgfqpoint{0.273cm}{1.789cm}}
\pgfpathcurveto{\pgfqpoint{0.273cm}{1.825cm}}{\pgfqpoint{0.259cm}{1.86cm}}{\pgfqpoint{0.233cm}{1.886cm}}
\pgfpathcurveto{\pgfqpoint{0.207cm}{1.912cm}}{\pgfqpoint{0.173cm}{1.926cm}}{\pgfqpoint{0.137cm}{1.926cm}}
\pgfpathcurveto{\pgfqpoint{0.1cm}{1.926cm}}{\pgfqpoint{0.066cm}{1.912cm}}{\pgfqpoint{0.04cm}{1.886cm}}
\pgfpathcurveto{\pgfqpoint{0.014cm}{1.86cm}}{\pgfqpoint{0cm}{1.825cm}}{\pgfqpoint{0cm}{1.789cm}}
\pgfpathcurveto{\pgfqpoint{0cm}{1.753cm}}{\pgfqpoint{0.014cm}{1.718cm}}{\pgfqpoint{0.04cm}{1.692cm}}
\pgfpathcurveto{\pgfqpoint{0.066cm}{1.667cm}}{\pgfqpoint{0.1cm}{1.652cm}}{\pgfqpoint{0.137cm}{1.652cm}}
\pgfpathcurveto{\pgfqpoint{0.173cm}{1.652cm}}{\pgfqpoint{0.207cm}{1.667cm}}{\pgfqpoint{0.233cm}{1.692cm}}
\pgfpathcurveto{\pgfqpoint{0.259cm}{1.718cm}}{\pgfqpoint{0.273cm}{1.753cm}}{\pgfqpoint{0.273cm}{1.789cm}}
\pgfusepath{fill}
\begin{pgfscope}
\pgfsetdash{}{0cm}
\pgfsetlinewidth{0.818mm}
\pgfsetmiterlimit{7.0}
\pgfpathmoveto{\pgfqpoint{0.682cm}{1.065cm}}
\pgfpathlineto{\pgfqpoint{0.679cm}{1.812cm}}
\pgfusepath{stroke}
\end{pgfscope}
\pgfpathmoveto{\pgfqpoint{0.815cm}{1.793cm}}
\pgfpathcurveto{\pgfqpoint{0.815cm}{1.829cm}}{\pgfqpoint{0.801cm}{1.864cm}}{\pgfqpoint{0.775cm}{1.89cm}}
\pgfpathcurveto{\pgfqpoint{0.75cm}{1.915cm}}{\pgfqpoint{0.715cm}{1.93cm}}{\pgfqpoint{0.679cm}{1.93cm}}
\pgfpathcurveto{\pgfqpoint{0.643cm}{1.93cm}}{\pgfqpoint{0.608cm}{1.915cm}}{\pgfqpoint{0.582cm}{1.89cm}}
\pgfpathcurveto{\pgfqpoint{0.557cm}{1.864cm}}{\pgfqpoint{0.542cm}{1.829cm}}{\pgfqpoint{0.542cm}{1.793cm}}
\pgfpathcurveto{\pgfqpoint{0.542cm}{1.756cm}}{\pgfqpoint{0.557cm}{1.722cm}}{\pgfqpoint{0.582cm}{1.696cm}}
\pgfpathcurveto{\pgfqpoint{0.608cm}{1.67cm}}{\pgfqpoint{0.643cm}{1.656cm}}{\pgfqpoint{0.679cm}{1.656cm}}
\pgfpathcurveto{\pgfqpoint{0.715cm}{1.656cm}}{\pgfqpoint{0.75cm}{1.67cm}}{\pgfqpoint{0.775cm}{1.696cm}}
\pgfpathcurveto{\pgfqpoint{0.801cm}{1.722cm}}{\pgfqpoint{0.815cm}{1.756cm}}{\pgfqpoint{0.815cm}{1.793cm}}
\pgfusepath{fill}
\pgfpathmoveto{\pgfqpoint{1.345cm}{1.765cm}}
\pgfpathcurveto{\pgfqpoint{1.345cm}{1.801cm}}{\pgfqpoint{1.331cm}{1.836cm}}{\pgfqpoint{1.305cm}{1.862cm}}
\pgfpathcurveto{\pgfqpoint{1.28cm}{1.887cm}}{\pgfqpoint{1.245cm}{1.902cm}}{\pgfqpoint{1.209cm}{1.902cm}}
\pgfpathcurveto{\pgfqpoint{1.172cm}{1.902cm}}{\pgfqpoint{1.138cm}{1.887cm}}{\pgfqpoint{1.112cm}{1.862cm}}
\pgfpathcurveto{\pgfqpoint{1.087cm}{1.836cm}}{\pgfqpoint{1.072cm}{1.801cm}}{\pgfqpoint{1.072cm}{1.765cm}}
\pgfpathcurveto{\pgfqpoint{1.072cm}{1.728cm}}{\pgfqpoint{1.087cm}{1.694cm}}{\pgfqpoint{1.112cm}{1.668cm}}
\pgfpathcurveto{\pgfqpoint{1.138cm}{1.642cm}}{\pgfqpoint{1.172cm}{1.628cm}}{\pgfqpoint{1.209cm}{1.628cm}}
\pgfpathcurveto{\pgfqpoint{1.245cm}{1.628cm}}{\pgfqpoint{1.28cm}{1.642cm}}{\pgfqpoint{1.305cm}{1.668cm}}
\pgfpathcurveto{\pgfqpoint{1.331cm}{1.694cm}}{\pgfqpoint{1.345cm}{1.728cm}}{\pgfqpoint{1.345cm}{1.765cm}}
\pgfusepath{fill}
\begin{pgfscope}
\pgfsetdash{}{0cm}
\pgfsetlinewidth{0.818mm}
\pgfsetroundcap
\pgfsetroundjoin
\pgfsetmiterlimit{7.0}
\pgfpathmoveto{\pgfqpoint{0.682cm}{1.065cm}}
\pgfpathlineto{\pgfqpoint{1.246cm}{0.315cm}}
\pgfpathlineto{\pgfqpoint{1.811cm}{1.065cm}}
\pgfusepath{stroke}
\end{pgfscope}
\pgfpathmoveto{\pgfqpoint{1.948cm}{1.065cm}}
\pgfpathcurveto{\pgfqpoint{1.948cm}{1.101cm}}{\pgfqpoint{1.933cm}{1.136cm}}{\pgfqpoint{1.907cm}{1.162cm}}
\pgfpathcurveto{\pgfqpoint{1.882cm}{1.187cm}}{\pgfqpoint{1.847cm}{1.202cm}}{\pgfqpoint{1.811cm}{1.202cm}}
\pgfpathcurveto{\pgfqpoint{1.775cm}{1.202cm}}{\pgfqpoint{1.74cm}{1.187cm}}{\pgfqpoint{1.714cm}{1.162cm}}
\pgfpathcurveto{\pgfqpoint{1.689cm}{1.136cm}}{\pgfqpoint{1.674cm}{1.101cm}}{\pgfqpoint{1.674cm}{1.065cm}}
\pgfpathcurveto{\pgfqpoint{1.674cm}{1.029cm}}{\pgfqpoint{1.689cm}{0.994cm}}{\pgfqpoint{1.714cm}{0.968cm}}
\pgfpathcurveto{\pgfqpoint{1.74cm}{0.942cm}}{\pgfqpoint{1.775cm}{0.928cm}}{\pgfqpoint{1.811cm}{0.928cm}}
\pgfpathcurveto{\pgfqpoint{1.847cm}{0.928cm}}{\pgfqpoint{1.882cm}{0.942cm}}{\pgfqpoint{1.907cm}{0.968cm}}
\pgfpathcurveto{\pgfqpoint{1.933cm}{0.994cm}}{\pgfqpoint{1.948cm}{1.029cm}}{\pgfqpoint{1.948cm}{1.065cm}}
\pgfusepath{fill}
\begin{pgfscope}
\pgfsetdash{}{0cm}
\pgfsetlinewidth{0.818mm}
\pgfsetmiterlimit{4.0}
\pgfpathmoveto{\pgfqpoint{1.383cm}{0.178cm}}
\pgfpathcurveto{\pgfqpoint{1.383cm}{0.214cm}}{\pgfqpoint{1.369cm}{0.249cm}}{\pgfqpoint{1.343cm}{0.275cm}}
\pgfpathcurveto{\pgfqpoint{1.317cm}{0.3cm}}{\pgfqpoint{1.283cm}{0.315cm}}{\pgfqpoint{1.246cm}{0.315cm}}
\pgfpathcurveto{\pgfqpoint{1.21cm}{0.315cm}}{\pgfqpoint{1.175cm}{0.3cm}}{\pgfqpoint{1.15cm}{0.275cm}}
\pgfpathcurveto{\pgfqpoint{1.124cm}{0.249cm}}{\pgfqpoint{1.11cm}{0.214cm}}{\pgfqpoint{1.11cm}{0.178cm}}
\pgfpathcurveto{\pgfqpoint{1.11cm}{0.141cm}}{\pgfqpoint{1.124cm}{0.107cm}}{\pgfqpoint{1.15cm}{0.081cm}}
\pgfpathcurveto{\pgfqpoint{1.175cm}{0.055cm}}{\pgfqpoint{1.21cm}{0.041cm}}{\pgfqpoint{1.246cm}{0.041cm}}
\pgfpathcurveto{\pgfqpoint{1.283cm}{0.041cm}}{\pgfqpoint{1.317cm}{0.055cm}}{\pgfqpoint{1.343cm}{0.081cm}}
\pgfpathcurveto{\pgfqpoint{1.369cm}{0.107cm}}{\pgfqpoint{1.383cm}{0.141cm}}{\pgfqpoint{1.383cm}{0.178cm}}
\pgfusepath{stroke}
\end{pgfscope}
\end{pgfscope}
\end{pgfscope}
\end{pgfscope}
\end{tikzpicture}}}\|_{\CC^\gamma(\rho^{2+\gamma})}\lesssim 2^{(\gamma+\kappa)K/2} \| X^{\!\resizebox{!}{.8em}{
\begin{tikzpicture}
\pgfpathmoveto{\pgfqpoint{0cm}{-0.035cm}}
\pgfpathlineto{\pgfqpoint{1.976cm}{-0.035cm}}
\pgfpathlineto{\pgfqpoint{1.976cm}{1.94cm}}
\pgfpathlineto{\pgfqpoint{0cm}{1.94cm}}
\pgfpathclose
\pgfusepath{clip}
\begin{pgfscope}
\begin{pgfscope}
\pgfpathmoveto{\pgfqpoint{0cm}{-0.035cm}}
\pgfpathlineto{\pgfqpoint{1.976cm}{-0.035cm}}
\pgfpathlineto{\pgfqpoint{1.976cm}{1.94cm}}
\pgfpathlineto{\pgfqpoint{0cm}{1.94cm}}
\pgfpathclose
\pgfusepath{clip}
\begin{pgfscope}
\begin{pgfscope}
\pgfsetdash{}{0cm}
\pgfsetlinewidth{0.818mm}
\pgfsetroundcap
\pgfsetroundjoin
\pgfsetmiterlimit{7.0}
\definecolor{eps2pgf_color}{gray}{0}\pgfsetstrokecolor{eps2pgf_color}\pgfsetfillcolor{eps2pgf_color}
\pgfpathmoveto{\pgfqpoint{0.117cm}{1.815cm}}
\pgfpathlineto{\pgfqpoint{0.682cm}{1.065cm}}
\pgfpathlineto{\pgfqpoint{1.246cm}{1.815cm}}
\pgfusepath{stroke}
\end{pgfscope}
\definecolor{eps2pgf_color}{gray}{0}\pgfsetstrokecolor{eps2pgf_color}\pgfsetfillcolor{eps2pgf_color}
\pgfpathmoveto{\pgfqpoint{0.273cm}{1.789cm}}
\pgfpathcurveto{\pgfqpoint{0.273cm}{1.825cm}}{\pgfqpoint{0.259cm}{1.86cm}}{\pgfqpoint{0.233cm}{1.886cm}}
\pgfpathcurveto{\pgfqpoint{0.207cm}{1.912cm}}{\pgfqpoint{0.173cm}{1.926cm}}{\pgfqpoint{0.137cm}{1.926cm}}
\pgfpathcurveto{\pgfqpoint{0.1cm}{1.926cm}}{\pgfqpoint{0.066cm}{1.912cm}}{\pgfqpoint{0.04cm}{1.886cm}}
\pgfpathcurveto{\pgfqpoint{0.014cm}{1.86cm}}{\pgfqpoint{0cm}{1.825cm}}{\pgfqpoint{0cm}{1.789cm}}
\pgfpathcurveto{\pgfqpoint{0cm}{1.753cm}}{\pgfqpoint{0.014cm}{1.718cm}}{\pgfqpoint{0.04cm}{1.692cm}}
\pgfpathcurveto{\pgfqpoint{0.066cm}{1.667cm}}{\pgfqpoint{0.1cm}{1.652cm}}{\pgfqpoint{0.137cm}{1.652cm}}
\pgfpathcurveto{\pgfqpoint{0.173cm}{1.652cm}}{\pgfqpoint{0.207cm}{1.667cm}}{\pgfqpoint{0.233cm}{1.692cm}}
\pgfpathcurveto{\pgfqpoint{0.259cm}{1.718cm}}{\pgfqpoint{0.273cm}{1.753cm}}{\pgfqpoint{0.273cm}{1.789cm}}
\pgfusepath{fill}
\begin{pgfscope}
\pgfsetdash{}{0cm}
\pgfsetlinewidth{0.818mm}
\pgfsetmiterlimit{7.0}
\pgfpathmoveto{\pgfqpoint{0.682cm}{1.065cm}}
\pgfpathlineto{\pgfqpoint{0.679cm}{1.812cm}}
\pgfusepath{stroke}
\end{pgfscope}
\pgfpathmoveto{\pgfqpoint{0.815cm}{1.793cm}}
\pgfpathcurveto{\pgfqpoint{0.815cm}{1.829cm}}{\pgfqpoint{0.801cm}{1.864cm}}{\pgfqpoint{0.775cm}{1.89cm}}
\pgfpathcurveto{\pgfqpoint{0.75cm}{1.915cm}}{\pgfqpoint{0.715cm}{1.93cm}}{\pgfqpoint{0.679cm}{1.93cm}}
\pgfpathcurveto{\pgfqpoint{0.643cm}{1.93cm}}{\pgfqpoint{0.608cm}{1.915cm}}{\pgfqpoint{0.582cm}{1.89cm}}
\pgfpathcurveto{\pgfqpoint{0.557cm}{1.864cm}}{\pgfqpoint{0.542cm}{1.829cm}}{\pgfqpoint{0.542cm}{1.793cm}}
\pgfpathcurveto{\pgfqpoint{0.542cm}{1.756cm}}{\pgfqpoint{0.557cm}{1.722cm}}{\pgfqpoint{0.582cm}{1.696cm}}
\pgfpathcurveto{\pgfqpoint{0.608cm}{1.67cm}}{\pgfqpoint{0.643cm}{1.656cm}}{\pgfqpoint{0.679cm}{1.656cm}}
\pgfpathcurveto{\pgfqpoint{0.715cm}{1.656cm}}{\pgfqpoint{0.75cm}{1.67cm}}{\pgfqpoint{0.775cm}{1.696cm}}
\pgfpathcurveto{\pgfqpoint{0.801cm}{1.722cm}}{\pgfqpoint{0.815cm}{1.756cm}}{\pgfqpoint{0.815cm}{1.793cm}}
\pgfusepath{fill}
\pgfpathmoveto{\pgfqpoint{1.345cm}{1.765cm}}
\pgfpathcurveto{\pgfqpoint{1.345cm}{1.801cm}}{\pgfqpoint{1.331cm}{1.836cm}}{\pgfqpoint{1.305cm}{1.862cm}}
\pgfpathcurveto{\pgfqpoint{1.28cm}{1.887cm}}{\pgfqpoint{1.245cm}{1.902cm}}{\pgfqpoint{1.209cm}{1.902cm}}
\pgfpathcurveto{\pgfqpoint{1.172cm}{1.902cm}}{\pgfqpoint{1.138cm}{1.887cm}}{\pgfqpoint{1.112cm}{1.862cm}}
\pgfpathcurveto{\pgfqpoint{1.087cm}{1.836cm}}{\pgfqpoint{1.072cm}{1.801cm}}{\pgfqpoint{1.072cm}{1.765cm}}
\pgfpathcurveto{\pgfqpoint{1.072cm}{1.728cm}}{\pgfqpoint{1.087cm}{1.694cm}}{\pgfqpoint{1.112cm}{1.668cm}}
\pgfpathcurveto{\pgfqpoint{1.138cm}{1.642cm}}{\pgfqpoint{1.172cm}{1.628cm}}{\pgfqpoint{1.209cm}{1.628cm}}
\pgfpathcurveto{\pgfqpoint{1.245cm}{1.628cm}}{\pgfqpoint{1.28cm}{1.642cm}}{\pgfqpoint{1.305cm}{1.668cm}}
\pgfpathcurveto{\pgfqpoint{1.331cm}{1.694cm}}{\pgfqpoint{1.345cm}{1.728cm}}{\pgfqpoint{1.345cm}{1.765cm}}
\pgfusepath{fill}
\begin{pgfscope}
\pgfsetdash{}{0cm}
\pgfsetlinewidth{0.818mm}
\pgfsetroundcap
\pgfsetroundjoin
\pgfsetmiterlimit{7.0}
\pgfpathmoveto{\pgfqpoint{0.682cm}{1.065cm}}
\pgfpathlineto{\pgfqpoint{1.246cm}{0.315cm}}
\pgfpathlineto{\pgfqpoint{1.811cm}{1.065cm}}
\pgfusepath{stroke}
\end{pgfscope}
\pgfpathmoveto{\pgfqpoint{1.948cm}{1.065cm}}
\pgfpathcurveto{\pgfqpoint{1.948cm}{1.101cm}}{\pgfqpoint{1.933cm}{1.136cm}}{\pgfqpoint{1.907cm}{1.162cm}}
\pgfpathcurveto{\pgfqpoint{1.882cm}{1.187cm}}{\pgfqpoint{1.847cm}{1.202cm}}{\pgfqpoint{1.811cm}{1.202cm}}
\pgfpathcurveto{\pgfqpoint{1.775cm}{1.202cm}}{\pgfqpoint{1.74cm}{1.187cm}}{\pgfqpoint{1.714cm}{1.162cm}}
\pgfpathcurveto{\pgfqpoint{1.689cm}{1.136cm}}{\pgfqpoint{1.674cm}{1.101cm}}{\pgfqpoint{1.674cm}{1.065cm}}
\pgfpathcurveto{\pgfqpoint{1.674cm}{1.029cm}}{\pgfqpoint{1.689cm}{0.994cm}}{\pgfqpoint{1.714cm}{0.968cm}}
\pgfpathcurveto{\pgfqpoint{1.74cm}{0.942cm}}{\pgfqpoint{1.775cm}{0.928cm}}{\pgfqpoint{1.811cm}{0.928cm}}
\pgfpathcurveto{\pgfqpoint{1.847cm}{0.928cm}}{\pgfqpoint{1.882cm}{0.942cm}}{\pgfqpoint{1.907cm}{0.968cm}}
\pgfpathcurveto{\pgfqpoint{1.933cm}{0.994cm}}{\pgfqpoint{1.948cm}{1.029cm}}{\pgfqpoint{1.948cm}{1.065cm}}
\pgfusepath{fill}
\begin{pgfscope}
\pgfsetdash{}{0cm}
\pgfsetlinewidth{0.818mm}
\pgfsetmiterlimit{4.0}
\pgfpathmoveto{\pgfqpoint{1.383cm}{0.178cm}}
\pgfpathcurveto{\pgfqpoint{1.383cm}{0.214cm}}{\pgfqpoint{1.369cm}{0.249cm}}{\pgfqpoint{1.343cm}{0.275cm}}
\pgfpathcurveto{\pgfqpoint{1.317cm}{0.3cm}}{\pgfqpoint{1.283cm}{0.315cm}}{\pgfqpoint{1.246cm}{0.315cm}}
\pgfpathcurveto{\pgfqpoint{1.21cm}{0.315cm}}{\pgfqpoint{1.175cm}{0.3cm}}{\pgfqpoint{1.15cm}{0.275cm}}
\pgfpathcurveto{\pgfqpoint{1.124cm}{0.249cm}}{\pgfqpoint{1.11cm}{0.214cm}}{\pgfqpoint{1.11cm}{0.178cm}}
\pgfpathcurveto{\pgfqpoint{1.11cm}{0.141cm}}{\pgfqpoint{1.124cm}{0.107cm}}{\pgfqpoint{1.15cm}{0.081cm}}
\pgfpathcurveto{\pgfqpoint{1.175cm}{0.055cm}}{\pgfqpoint{1.21cm}{0.041cm}}{\pgfqpoint{1.246cm}{0.041cm}}
\pgfpathcurveto{\pgfqpoint{1.283cm}{0.041cm}}{\pgfqpoint{1.317cm}{0.055cm}}{\pgfqpoint{1.343cm}{0.081cm}}
\pgfpathcurveto{\pgfqpoint{1.369cm}{0.107cm}}{\pgfqpoint{1.383cm}{0.141cm}}{\pgfqpoint{1.383cm}{0.178cm}}
\pgfusepath{stroke}
\end{pgfscope}
\end{pgfscope}
\end{pgfscope}
\end{pgfscope}
\end{tikzpicture}}}\|_{\CC^{-\kappa}(\rho^{\sigma})}
\end{align*}
\begin{align*}
\|3\UU_\leqslant X\|_{\CC^\gamma(\rho^{1+\gamma})}\lesssim 2^{(\frac{1}{2}+\gamma+\kappa)\frac{4}{3}K}\|X\|_{\CC^{-\frac{1}{2}-\kappa}(\rho^{\sigma})}
\end{align*}
This leads to
\begin{align*}
\begin{aligned}
\|3\UU_\leqslant\llbracket X^2 \rrbracket\succ(\phi+\psi)\|_{\CC^\gamma(\rho^{3+\gamma})}&\lesssim \|\phi+\psi\|_{L^\infty(\rho)}\|\UU_\leqslant\llbracket X^2 \rrbracket\|_{\CC^\gamma(\rho^{2+\gamma})}\\
&\lesssim 2^{(1+\gamma+\kappa)K}\|\phi+\psi\|_{L^\infty(\rho)}\lesssim 1+\|\psi\|^{2+\varepsilon}_{L^\infty(\rho)},
\end{aligned}
\end{align*}
\begin{align*}
\begin{aligned}
\|3\UU_\leqslant\llbracket X^2 \rrbracket\prec(\phi+\psi)\|_{\CC^\gamma(\rho^{3+\gamma})}&\lesssim\|\phi+\psi\|_{\CC^{\gamma}(\rho^{1+\gamma})} \|\UU_\leqslant\llbracket X^2 \rrbracket\|_{\CC^\gamma(\rho^{2})}\\
&\lesssim 2^{(1+\gamma+\kappa)K}\|\phi+\psi\|_{\CC^{\gamma}(\rho^{1+\gamma})} \lesssim (1+\|\psi\|^{1+\varepsilon}_{L^\infty(\rho)})(1+\|\psi\|_{\CC^{\gamma}(\rho^{1+\gamma})})\\
&\lesssim 1+ \|\psi\|^{1+\varepsilon}_{L^\infty(\rho)} +\|\psi\|^{1+\varepsilon}_{L^\infty(\rho)}\|\psi\|_{\CC^{\gamma}(\rho^{1+\gamma})},
\end{aligned}
\end{align*}
\begin{align*}
\|3( \phi + \psi)\prec\UU_\leqslantX^{\!\resizebox{!}{.8em}{
\begin{tikzpicture}
\pgfpathmoveto{\pgfqpoint{0cm}{-0.035cm}}
\pgfpathlineto{\pgfqpoint{1.976cm}{-0.035cm}}
\pgfpathlineto{\pgfqpoint{1.976cm}{1.94cm}}
\pgfpathlineto{\pgfqpoint{0cm}{1.94cm}}
\pgfpathclose
\pgfusepath{clip}
\begin{pgfscope}
\begin{pgfscope}
\pgfpathmoveto{\pgfqpoint{0cm}{-0.035cm}}
\pgfpathlineto{\pgfqpoint{1.976cm}{-0.035cm}}
\pgfpathlineto{\pgfqpoint{1.976cm}{1.94cm}}
\pgfpathlineto{\pgfqpoint{0cm}{1.94cm}}
\pgfpathclose
\pgfusepath{clip}
\begin{pgfscope}
\begin{pgfscope}
\pgfsetdash{}{0cm}
\pgfsetlinewidth{0.818mm}
\pgfsetroundcap
\pgfsetroundjoin
\pgfsetmiterlimit{7.0}
\definecolor{eps2pgf_color}{gray}{0}\pgfsetstrokecolor{eps2pgf_color}\pgfsetfillcolor{eps2pgf_color}
\pgfpathmoveto{\pgfqpoint{0.117cm}{1.815cm}}
\pgfpathlineto{\pgfqpoint{0.682cm}{1.065cm}}
\pgfpathlineto{\pgfqpoint{1.246cm}{1.815cm}}
\pgfusepath{stroke}
\end{pgfscope}
\definecolor{eps2pgf_color}{gray}{0}\pgfsetstrokecolor{eps2pgf_color}\pgfsetfillcolor{eps2pgf_color}
\pgfpathmoveto{\pgfqpoint{0.273cm}{1.789cm}}
\pgfpathcurveto{\pgfqpoint{0.273cm}{1.825cm}}{\pgfqpoint{0.259cm}{1.86cm}}{\pgfqpoint{0.233cm}{1.886cm}}
\pgfpathcurveto{\pgfqpoint{0.207cm}{1.912cm}}{\pgfqpoint{0.173cm}{1.926cm}}{\pgfqpoint{0.137cm}{1.926cm}}
\pgfpathcurveto{\pgfqpoint{0.1cm}{1.926cm}}{\pgfqpoint{0.066cm}{1.912cm}}{\pgfqpoint{0.04cm}{1.886cm}}
\pgfpathcurveto{\pgfqpoint{0.014cm}{1.86cm}}{\pgfqpoint{0cm}{1.825cm}}{\pgfqpoint{0cm}{1.789cm}}
\pgfpathcurveto{\pgfqpoint{0cm}{1.753cm}}{\pgfqpoint{0.014cm}{1.718cm}}{\pgfqpoint{0.04cm}{1.692cm}}
\pgfpathcurveto{\pgfqpoint{0.066cm}{1.667cm}}{\pgfqpoint{0.1cm}{1.652cm}}{\pgfqpoint{0.137cm}{1.652cm}}
\pgfpathcurveto{\pgfqpoint{0.173cm}{1.652cm}}{\pgfqpoint{0.207cm}{1.667cm}}{\pgfqpoint{0.233cm}{1.692cm}}
\pgfpathcurveto{\pgfqpoint{0.259cm}{1.718cm}}{\pgfqpoint{0.273cm}{1.753cm}}{\pgfqpoint{0.273cm}{1.789cm}}
\pgfusepath{fill}
\pgfpathmoveto{\pgfqpoint{1.345cm}{1.765cm}}
\pgfpathcurveto{\pgfqpoint{1.345cm}{1.801cm}}{\pgfqpoint{1.331cm}{1.836cm}}{\pgfqpoint{1.305cm}{1.862cm}}
\pgfpathcurveto{\pgfqpoint{1.28cm}{1.887cm}}{\pgfqpoint{1.245cm}{1.902cm}}{\pgfqpoint{1.209cm}{1.902cm}}
\pgfpathcurveto{\pgfqpoint{1.172cm}{1.902cm}}{\pgfqpoint{1.138cm}{1.887cm}}{\pgfqpoint{1.112cm}{1.862cm}}
\pgfpathcurveto{\pgfqpoint{1.087cm}{1.836cm}}{\pgfqpoint{1.072cm}{1.801cm}}{\pgfqpoint{1.072cm}{1.765cm}}
\pgfpathcurveto{\pgfqpoint{1.072cm}{1.728cm}}{\pgfqpoint{1.087cm}{1.694cm}}{\pgfqpoint{1.112cm}{1.668cm}}
\pgfpathcurveto{\pgfqpoint{1.138cm}{1.642cm}}{\pgfqpoint{1.172cm}{1.628cm}}{\pgfqpoint{1.209cm}{1.628cm}}
\pgfpathcurveto{\pgfqpoint{1.245cm}{1.628cm}}{\pgfqpoint{1.28cm}{1.642cm}}{\pgfqpoint{1.305cm}{1.668cm}}
\pgfpathcurveto{\pgfqpoint{1.331cm}{1.694cm}}{\pgfqpoint{1.345cm}{1.728cm}}{\pgfqpoint{1.345cm}{1.765cm}}
\pgfusepath{fill}
\begin{pgfscope}
\pgfsetdash{}{0cm}
\pgfsetlinewidth{0.818mm}
\pgfsetroundcap
\pgfsetroundjoin
\pgfsetmiterlimit{7.0}
\pgfpathmoveto{\pgfqpoint{0.682cm}{1.065cm}}
\pgfpathlineto{\pgfqpoint{1.246cm}{0.315cm}}
\pgfpathlineto{\pgfqpoint{1.811cm}{1.065cm}}
\pgfusepath{stroke}
\end{pgfscope}
\pgfpathmoveto{\pgfqpoint{1.948cm}{1.065cm}}
\pgfpathcurveto{\pgfqpoint{1.948cm}{1.101cm}}{\pgfqpoint{1.933cm}{1.136cm}}{\pgfqpoint{1.907cm}{1.162cm}}
\pgfpathcurveto{\pgfqpoint{1.882cm}{1.187cm}}{\pgfqpoint{1.847cm}{1.202cm}}{\pgfqpoint{1.811cm}{1.202cm}}
\pgfpathcurveto{\pgfqpoint{1.775cm}{1.202cm}}{\pgfqpoint{1.74cm}{1.187cm}}{\pgfqpoint{1.714cm}{1.162cm}}
\pgfpathcurveto{\pgfqpoint{1.689cm}{1.136cm}}{\pgfqpoint{1.674cm}{1.101cm}}{\pgfqpoint{1.674cm}{1.065cm}}
\pgfpathcurveto{\pgfqpoint{1.674cm}{1.029cm}}{\pgfqpoint{1.689cm}{0.994cm}}{\pgfqpoint{1.714cm}{0.968cm}}
\pgfpathcurveto{\pgfqpoint{1.74cm}{0.942cm}}{\pgfqpoint{1.775cm}{0.928cm}}{\pgfqpoint{1.811cm}{0.928cm}}
\pgfpathcurveto{\pgfqpoint{1.847cm}{0.928cm}}{\pgfqpoint{1.882cm}{0.942cm}}{\pgfqpoint{1.907cm}{0.968cm}}
\pgfpathcurveto{\pgfqpoint{1.933cm}{0.994cm}}{\pgfqpoint{1.948cm}{1.029cm}}{\pgfqpoint{1.948cm}{1.065cm}}
\pgfusepath{fill}
\begin{pgfscope}
\pgfsetdash{}{0cm}
\pgfsetlinewidth{0.818mm}
\pgfsetmiterlimit{7.0}
\pgfpathmoveto{\pgfqpoint{1.246cm}{0.315cm}}
\pgfpathlineto{\pgfqpoint{1.244cm}{1.061cm}}
\pgfusepath{stroke}
\end{pgfscope}
\pgfpathmoveto{\pgfqpoint{1.38cm}{1.065cm}}
\pgfpathcurveto{\pgfqpoint{1.38cm}{1.101cm}}{\pgfqpoint{1.366cm}{1.136cm}}{\pgfqpoint{1.34cm}{1.162cm}}
\pgfpathcurveto{\pgfqpoint{1.315cm}{1.187cm}}{\pgfqpoint{1.28cm}{1.202cm}}{\pgfqpoint{1.244cm}{1.202cm}}
\pgfpathcurveto{\pgfqpoint{1.207cm}{1.202cm}}{\pgfqpoint{1.173cm}{1.187cm}}{\pgfqpoint{1.147cm}{1.162cm}}
\pgfpathcurveto{\pgfqpoint{1.121cm}{1.136cm}}{\pgfqpoint{1.107cm}{1.101cm}}{\pgfqpoint{1.107cm}{1.065cm}}
\pgfpathcurveto{\pgfqpoint{1.107cm}{1.029cm}}{\pgfqpoint{1.121cm}{0.994cm}}{\pgfqpoint{1.147cm}{0.968cm}}
\pgfpathcurveto{\pgfqpoint{1.173cm}{0.942cm}}{\pgfqpoint{1.207cm}{0.928cm}}{\pgfqpoint{1.244cm}{0.928cm}}
\pgfpathcurveto{\pgfqpoint{1.28cm}{0.928cm}}{\pgfqpoint{1.315cm}{0.942cm}}{\pgfqpoint{1.34cm}{0.968cm}}
\pgfpathcurveto{\pgfqpoint{1.366cm}{0.994cm}}{\pgfqpoint{1.38cm}{1.029cm}}{\pgfqpoint{1.38cm}{1.065cm}}
\pgfusepath{fill}
\begin{pgfscope}
\pgfsetdash{}{0cm}
\pgfsetlinewidth{0.818mm}
\pgfsetmiterlimit{4.0}
\pgfpathmoveto{\pgfqpoint{1.383cm}{0.178cm}}
\pgfpathcurveto{\pgfqpoint{1.383cm}{0.214cm}}{\pgfqpoint{1.369cm}{0.249cm}}{\pgfqpoint{1.343cm}{0.275cm}}
\pgfpathcurveto{\pgfqpoint{1.317cm}{0.3cm}}{\pgfqpoint{1.283cm}{0.315cm}}{\pgfqpoint{1.246cm}{0.315cm}}
\pgfpathcurveto{\pgfqpoint{1.21cm}{0.315cm}}{\pgfqpoint{1.175cm}{0.3cm}}{\pgfqpoint{1.15cm}{0.275cm}}
\pgfpathcurveto{\pgfqpoint{1.124cm}{0.249cm}}{\pgfqpoint{1.11cm}{0.214cm}}{\pgfqpoint{1.11cm}{0.178cm}}
\pgfpathcurveto{\pgfqpoint{1.11cm}{0.141cm}}{\pgfqpoint{1.124cm}{0.107cm}}{\pgfqpoint{1.15cm}{0.081cm}}
\pgfpathcurveto{\pgfqpoint{1.175cm}{0.055cm}}{\pgfqpoint{1.21cm}{0.041cm}}{\pgfqpoint{1.246cm}{0.041cm}}
\pgfpathcurveto{\pgfqpoint{1.283cm}{0.041cm}}{\pgfqpoint{1.317cm}{0.055cm}}{\pgfqpoint{1.343cm}{0.081cm}}
\pgfpathcurveto{\pgfqpoint{1.369cm}{0.107cm}}{\pgfqpoint{1.383cm}{0.141cm}}{\pgfqpoint{1.383cm}{0.178cm}}
\pgfusepath{stroke}
\end{pgfscope}
\end{pgfscope}
\end{pgfscope}
\end{pgfscope}
\end{tikzpicture}}}\|_{\CC^\gamma(\rho^{3+\gamma})}&\lesssim \|\phi+\psi\|_{L^\infty(\rho)}\|\UU_\leqslant X^{\!\resizebox{!}{.8em}{
\begin{tikzpicture}
\pgfpathmoveto{\pgfqpoint{0cm}{-0.035cm}}
\pgfpathlineto{\pgfqpoint{1.976cm}{-0.035cm}}
\pgfpathlineto{\pgfqpoint{1.976cm}{1.94cm}}
\pgfpathlineto{\pgfqpoint{0cm}{1.94cm}}
\pgfpathclose
\pgfusepath{clip}
\begin{pgfscope}
\begin{pgfscope}
\pgfpathmoveto{\pgfqpoint{0cm}{-0.035cm}}
\pgfpathlineto{\pgfqpoint{1.976cm}{-0.035cm}}
\pgfpathlineto{\pgfqpoint{1.976cm}{1.94cm}}
\pgfpathlineto{\pgfqpoint{0cm}{1.94cm}}
\pgfpathclose
\pgfusepath{clip}
\begin{pgfscope}
\begin{pgfscope}
\pgfsetdash{}{0cm}
\pgfsetlinewidth{0.818mm}
\pgfsetroundcap
\pgfsetroundjoin
\pgfsetmiterlimit{7.0}
\definecolor{eps2pgf_color}{gray}{0}\pgfsetstrokecolor{eps2pgf_color}\pgfsetfillcolor{eps2pgf_color}
\pgfpathmoveto{\pgfqpoint{0.117cm}{1.815cm}}
\pgfpathlineto{\pgfqpoint{0.682cm}{1.065cm}}
\pgfpathlineto{\pgfqpoint{1.246cm}{1.815cm}}
\pgfusepath{stroke}
\end{pgfscope}
\definecolor{eps2pgf_color}{gray}{0}\pgfsetstrokecolor{eps2pgf_color}\pgfsetfillcolor{eps2pgf_color}
\pgfpathmoveto{\pgfqpoint{0.273cm}{1.789cm}}
\pgfpathcurveto{\pgfqpoint{0.273cm}{1.825cm}}{\pgfqpoint{0.259cm}{1.86cm}}{\pgfqpoint{0.233cm}{1.886cm}}
\pgfpathcurveto{\pgfqpoint{0.207cm}{1.912cm}}{\pgfqpoint{0.173cm}{1.926cm}}{\pgfqpoint{0.137cm}{1.926cm}}
\pgfpathcurveto{\pgfqpoint{0.1cm}{1.926cm}}{\pgfqpoint{0.066cm}{1.912cm}}{\pgfqpoint{0.04cm}{1.886cm}}
\pgfpathcurveto{\pgfqpoint{0.014cm}{1.86cm}}{\pgfqpoint{0cm}{1.825cm}}{\pgfqpoint{0cm}{1.789cm}}
\pgfpathcurveto{\pgfqpoint{0cm}{1.753cm}}{\pgfqpoint{0.014cm}{1.718cm}}{\pgfqpoint{0.04cm}{1.692cm}}
\pgfpathcurveto{\pgfqpoint{0.066cm}{1.667cm}}{\pgfqpoint{0.1cm}{1.652cm}}{\pgfqpoint{0.137cm}{1.652cm}}
\pgfpathcurveto{\pgfqpoint{0.173cm}{1.652cm}}{\pgfqpoint{0.207cm}{1.667cm}}{\pgfqpoint{0.233cm}{1.692cm}}
\pgfpathcurveto{\pgfqpoint{0.259cm}{1.718cm}}{\pgfqpoint{0.273cm}{1.753cm}}{\pgfqpoint{0.273cm}{1.789cm}}
\pgfusepath{fill}
\pgfpathmoveto{\pgfqpoint{1.345cm}{1.765cm}}
\pgfpathcurveto{\pgfqpoint{1.345cm}{1.801cm}}{\pgfqpoint{1.331cm}{1.836cm}}{\pgfqpoint{1.305cm}{1.862cm}}
\pgfpathcurveto{\pgfqpoint{1.28cm}{1.887cm}}{\pgfqpoint{1.245cm}{1.902cm}}{\pgfqpoint{1.209cm}{1.902cm}}
\pgfpathcurveto{\pgfqpoint{1.172cm}{1.902cm}}{\pgfqpoint{1.138cm}{1.887cm}}{\pgfqpoint{1.112cm}{1.862cm}}
\pgfpathcurveto{\pgfqpoint{1.087cm}{1.836cm}}{\pgfqpoint{1.072cm}{1.801cm}}{\pgfqpoint{1.072cm}{1.765cm}}
\pgfpathcurveto{\pgfqpoint{1.072cm}{1.728cm}}{\pgfqpoint{1.087cm}{1.694cm}}{\pgfqpoint{1.112cm}{1.668cm}}
\pgfpathcurveto{\pgfqpoint{1.138cm}{1.642cm}}{\pgfqpoint{1.172cm}{1.628cm}}{\pgfqpoint{1.209cm}{1.628cm}}
\pgfpathcurveto{\pgfqpoint{1.245cm}{1.628cm}}{\pgfqpoint{1.28cm}{1.642cm}}{\pgfqpoint{1.305cm}{1.668cm}}
\pgfpathcurveto{\pgfqpoint{1.331cm}{1.694cm}}{\pgfqpoint{1.345cm}{1.728cm}}{\pgfqpoint{1.345cm}{1.765cm}}
\pgfusepath{fill}
\begin{pgfscope}
\pgfsetdash{}{0cm}
\pgfsetlinewidth{0.818mm}
\pgfsetroundcap
\pgfsetroundjoin
\pgfsetmiterlimit{7.0}
\pgfpathmoveto{\pgfqpoint{0.682cm}{1.065cm}}
\pgfpathlineto{\pgfqpoint{1.246cm}{0.315cm}}
\pgfpathlineto{\pgfqpoint{1.811cm}{1.065cm}}
\pgfusepath{stroke}
\end{pgfscope}
\pgfpathmoveto{\pgfqpoint{1.948cm}{1.065cm}}
\pgfpathcurveto{\pgfqpoint{1.948cm}{1.101cm}}{\pgfqpoint{1.933cm}{1.136cm}}{\pgfqpoint{1.907cm}{1.162cm}}
\pgfpathcurveto{\pgfqpoint{1.882cm}{1.187cm}}{\pgfqpoint{1.847cm}{1.202cm}}{\pgfqpoint{1.811cm}{1.202cm}}
\pgfpathcurveto{\pgfqpoint{1.775cm}{1.202cm}}{\pgfqpoint{1.74cm}{1.187cm}}{\pgfqpoint{1.714cm}{1.162cm}}
\pgfpathcurveto{\pgfqpoint{1.689cm}{1.136cm}}{\pgfqpoint{1.674cm}{1.101cm}}{\pgfqpoint{1.674cm}{1.065cm}}
\pgfpathcurveto{\pgfqpoint{1.674cm}{1.029cm}}{\pgfqpoint{1.689cm}{0.994cm}}{\pgfqpoint{1.714cm}{0.968cm}}
\pgfpathcurveto{\pgfqpoint{1.74cm}{0.942cm}}{\pgfqpoint{1.775cm}{0.928cm}}{\pgfqpoint{1.811cm}{0.928cm}}
\pgfpathcurveto{\pgfqpoint{1.847cm}{0.928cm}}{\pgfqpoint{1.882cm}{0.942cm}}{\pgfqpoint{1.907cm}{0.968cm}}
\pgfpathcurveto{\pgfqpoint{1.933cm}{0.994cm}}{\pgfqpoint{1.948cm}{1.029cm}}{\pgfqpoint{1.948cm}{1.065cm}}
\pgfusepath{fill}
\begin{pgfscope}
\pgfsetdash{}{0cm}
\pgfsetlinewidth{0.818mm}
\pgfsetmiterlimit{7.0}
\pgfpathmoveto{\pgfqpoint{1.246cm}{0.315cm}}
\pgfpathlineto{\pgfqpoint{1.244cm}{1.061cm}}
\pgfusepath{stroke}
\end{pgfscope}
\pgfpathmoveto{\pgfqpoint{1.38cm}{1.065cm}}
\pgfpathcurveto{\pgfqpoint{1.38cm}{1.101cm}}{\pgfqpoint{1.366cm}{1.136cm}}{\pgfqpoint{1.34cm}{1.162cm}}
\pgfpathcurveto{\pgfqpoint{1.315cm}{1.187cm}}{\pgfqpoint{1.28cm}{1.202cm}}{\pgfqpoint{1.244cm}{1.202cm}}
\pgfpathcurveto{\pgfqpoint{1.207cm}{1.202cm}}{\pgfqpoint{1.173cm}{1.187cm}}{\pgfqpoint{1.147cm}{1.162cm}}
\pgfpathcurveto{\pgfqpoint{1.121cm}{1.136cm}}{\pgfqpoint{1.107cm}{1.101cm}}{\pgfqpoint{1.107cm}{1.065cm}}
\pgfpathcurveto{\pgfqpoint{1.107cm}{1.029cm}}{\pgfqpoint{1.121cm}{0.994cm}}{\pgfqpoint{1.147cm}{0.968cm}}
\pgfpathcurveto{\pgfqpoint{1.173cm}{0.942cm}}{\pgfqpoint{1.207cm}{0.928cm}}{\pgfqpoint{1.244cm}{0.928cm}}
\pgfpathcurveto{\pgfqpoint{1.28cm}{0.928cm}}{\pgfqpoint{1.315cm}{0.942cm}}{\pgfqpoint{1.34cm}{0.968cm}}
\pgfpathcurveto{\pgfqpoint{1.366cm}{0.994cm}}{\pgfqpoint{1.38cm}{1.029cm}}{\pgfqpoint{1.38cm}{1.065cm}}
\pgfusepath{fill}
\begin{pgfscope}
\pgfsetdash{}{0cm}
\pgfsetlinewidth{0.818mm}
\pgfsetmiterlimit{4.0}
\pgfpathmoveto{\pgfqpoint{1.383cm}{0.178cm}}
\pgfpathcurveto{\pgfqpoint{1.383cm}{0.214cm}}{\pgfqpoint{1.369cm}{0.249cm}}{\pgfqpoint{1.343cm}{0.275cm}}
\pgfpathcurveto{\pgfqpoint{1.317cm}{0.3cm}}{\pgfqpoint{1.283cm}{0.315cm}}{\pgfqpoint{1.246cm}{0.315cm}}
\pgfpathcurveto{\pgfqpoint{1.21cm}{0.315cm}}{\pgfqpoint{1.175cm}{0.3cm}}{\pgfqpoint{1.15cm}{0.275cm}}
\pgfpathcurveto{\pgfqpoint{1.124cm}{0.249cm}}{\pgfqpoint{1.11cm}{0.214cm}}{\pgfqpoint{1.11cm}{0.178cm}}
\pgfpathcurveto{\pgfqpoint{1.11cm}{0.141cm}}{\pgfqpoint{1.124cm}{0.107cm}}{\pgfqpoint{1.15cm}{0.081cm}}
\pgfpathcurveto{\pgfqpoint{1.175cm}{0.055cm}}{\pgfqpoint{1.21cm}{0.041cm}}{\pgfqpoint{1.246cm}{0.041cm}}
\pgfpathcurveto{\pgfqpoint{1.283cm}{0.041cm}}{\pgfqpoint{1.317cm}{0.055cm}}{\pgfqpoint{1.343cm}{0.081cm}}
\pgfpathcurveto{\pgfqpoint{1.369cm}{0.107cm}}{\pgfqpoint{1.383cm}{0.141cm}}{\pgfqpoint{1.383cm}{0.178cm}}
\pgfusepath{stroke}
\end{pgfscope}
\end{pgfscope}
\end{pgfscope}
\end{pgfscope}
\end{tikzpicture}}}\|_{\CC^{\gamma}(\rho^{2+\gamma})}\lesssim 2^{(\gamma+\kappa)K/2}\|\phi+\psi\|_{L^\infty(\rho)}\\
&\lesssim 1+\|\psi\|_{L^\infty(\rho)}^{1+\varepsilon},
\end{align*}
\begin{align*}
\|3( \phi + \psi)\succcurlyeqX^{\!\resizebox{!}{.8em}{
\begin{tikzpicture}
\pgfpathmoveto{\pgfqpoint{0cm}{-0.035cm}}
\pgfpathlineto{\pgfqpoint{1.976cm}{-0.035cm}}
\pgfpathlineto{\pgfqpoint{1.976cm}{1.94cm}}
\pgfpathlineto{\pgfqpoint{0cm}{1.94cm}}
\pgfpathclose
\pgfusepath{clip}
\begin{pgfscope}
\begin{pgfscope}
\pgfpathmoveto{\pgfqpoint{0cm}{-0.035cm}}
\pgfpathlineto{\pgfqpoint{1.976cm}{-0.035cm}}
\pgfpathlineto{\pgfqpoint{1.976cm}{1.94cm}}
\pgfpathlineto{\pgfqpoint{0cm}{1.94cm}}
\pgfpathclose
\pgfusepath{clip}
\begin{pgfscope}
\begin{pgfscope}
\pgfsetdash{}{0cm}
\pgfsetlinewidth{0.818mm}
\pgfsetroundcap
\pgfsetroundjoin
\pgfsetmiterlimit{7.0}
\definecolor{eps2pgf_color}{gray}{0}\pgfsetstrokecolor{eps2pgf_color}\pgfsetfillcolor{eps2pgf_color}
\pgfpathmoveto{\pgfqpoint{0.117cm}{1.815cm}}
\pgfpathlineto{\pgfqpoint{0.682cm}{1.065cm}}
\pgfpathlineto{\pgfqpoint{1.246cm}{1.815cm}}
\pgfusepath{stroke}
\end{pgfscope}
\definecolor{eps2pgf_color}{gray}{0}\pgfsetstrokecolor{eps2pgf_color}\pgfsetfillcolor{eps2pgf_color}
\pgfpathmoveto{\pgfqpoint{0.273cm}{1.789cm}}
\pgfpathcurveto{\pgfqpoint{0.273cm}{1.825cm}}{\pgfqpoint{0.259cm}{1.86cm}}{\pgfqpoint{0.233cm}{1.886cm}}
\pgfpathcurveto{\pgfqpoint{0.207cm}{1.912cm}}{\pgfqpoint{0.173cm}{1.926cm}}{\pgfqpoint{0.137cm}{1.926cm}}
\pgfpathcurveto{\pgfqpoint{0.1cm}{1.926cm}}{\pgfqpoint{0.066cm}{1.912cm}}{\pgfqpoint{0.04cm}{1.886cm}}
\pgfpathcurveto{\pgfqpoint{0.014cm}{1.86cm}}{\pgfqpoint{0cm}{1.825cm}}{\pgfqpoint{0cm}{1.789cm}}
\pgfpathcurveto{\pgfqpoint{0cm}{1.753cm}}{\pgfqpoint{0.014cm}{1.718cm}}{\pgfqpoint{0.04cm}{1.692cm}}
\pgfpathcurveto{\pgfqpoint{0.066cm}{1.667cm}}{\pgfqpoint{0.1cm}{1.652cm}}{\pgfqpoint{0.137cm}{1.652cm}}
\pgfpathcurveto{\pgfqpoint{0.173cm}{1.652cm}}{\pgfqpoint{0.207cm}{1.667cm}}{\pgfqpoint{0.233cm}{1.692cm}}
\pgfpathcurveto{\pgfqpoint{0.259cm}{1.718cm}}{\pgfqpoint{0.273cm}{1.753cm}}{\pgfqpoint{0.273cm}{1.789cm}}
\pgfusepath{fill}
\pgfpathmoveto{\pgfqpoint{1.345cm}{1.765cm}}
\pgfpathcurveto{\pgfqpoint{1.345cm}{1.801cm}}{\pgfqpoint{1.331cm}{1.836cm}}{\pgfqpoint{1.305cm}{1.862cm}}
\pgfpathcurveto{\pgfqpoint{1.28cm}{1.887cm}}{\pgfqpoint{1.245cm}{1.902cm}}{\pgfqpoint{1.209cm}{1.902cm}}
\pgfpathcurveto{\pgfqpoint{1.172cm}{1.902cm}}{\pgfqpoint{1.138cm}{1.887cm}}{\pgfqpoint{1.112cm}{1.862cm}}
\pgfpathcurveto{\pgfqpoint{1.087cm}{1.836cm}}{\pgfqpoint{1.072cm}{1.801cm}}{\pgfqpoint{1.072cm}{1.765cm}}
\pgfpathcurveto{\pgfqpoint{1.072cm}{1.728cm}}{\pgfqpoint{1.087cm}{1.694cm}}{\pgfqpoint{1.112cm}{1.668cm}}
\pgfpathcurveto{\pgfqpoint{1.138cm}{1.642cm}}{\pgfqpoint{1.172cm}{1.628cm}}{\pgfqpoint{1.209cm}{1.628cm}}
\pgfpathcurveto{\pgfqpoint{1.245cm}{1.628cm}}{\pgfqpoint{1.28cm}{1.642cm}}{\pgfqpoint{1.305cm}{1.668cm}}
\pgfpathcurveto{\pgfqpoint{1.331cm}{1.694cm}}{\pgfqpoint{1.345cm}{1.728cm}}{\pgfqpoint{1.345cm}{1.765cm}}
\pgfusepath{fill}
\begin{pgfscope}
\pgfsetdash{}{0cm}
\pgfsetlinewidth{0.818mm}
\pgfsetroundcap
\pgfsetroundjoin
\pgfsetmiterlimit{7.0}
\pgfpathmoveto{\pgfqpoint{0.682cm}{1.065cm}}
\pgfpathlineto{\pgfqpoint{1.246cm}{0.315cm}}
\pgfpathlineto{\pgfqpoint{1.811cm}{1.065cm}}
\pgfusepath{stroke}
\end{pgfscope}
\pgfpathmoveto{\pgfqpoint{1.948cm}{1.065cm}}
\pgfpathcurveto{\pgfqpoint{1.948cm}{1.101cm}}{\pgfqpoint{1.933cm}{1.136cm}}{\pgfqpoint{1.907cm}{1.162cm}}
\pgfpathcurveto{\pgfqpoint{1.882cm}{1.187cm}}{\pgfqpoint{1.847cm}{1.202cm}}{\pgfqpoint{1.811cm}{1.202cm}}
\pgfpathcurveto{\pgfqpoint{1.775cm}{1.202cm}}{\pgfqpoint{1.74cm}{1.187cm}}{\pgfqpoint{1.714cm}{1.162cm}}
\pgfpathcurveto{\pgfqpoint{1.689cm}{1.136cm}}{\pgfqpoint{1.674cm}{1.101cm}}{\pgfqpoint{1.674cm}{1.065cm}}
\pgfpathcurveto{\pgfqpoint{1.674cm}{1.029cm}}{\pgfqpoint{1.689cm}{0.994cm}}{\pgfqpoint{1.714cm}{0.968cm}}
\pgfpathcurveto{\pgfqpoint{1.74cm}{0.942cm}}{\pgfqpoint{1.775cm}{0.928cm}}{\pgfqpoint{1.811cm}{0.928cm}}
\pgfpathcurveto{\pgfqpoint{1.847cm}{0.928cm}}{\pgfqpoint{1.882cm}{0.942cm}}{\pgfqpoint{1.907cm}{0.968cm}}
\pgfpathcurveto{\pgfqpoint{1.933cm}{0.994cm}}{\pgfqpoint{1.948cm}{1.029cm}}{\pgfqpoint{1.948cm}{1.065cm}}
\pgfusepath{fill}
\begin{pgfscope}
\pgfsetdash{}{0cm}
\pgfsetlinewidth{0.818mm}
\pgfsetmiterlimit{7.0}
\pgfpathmoveto{\pgfqpoint{1.246cm}{0.315cm}}
\pgfpathlineto{\pgfqpoint{1.244cm}{1.061cm}}
\pgfusepath{stroke}
\end{pgfscope}
\pgfpathmoveto{\pgfqpoint{1.38cm}{1.065cm}}
\pgfpathcurveto{\pgfqpoint{1.38cm}{1.101cm}}{\pgfqpoint{1.366cm}{1.136cm}}{\pgfqpoint{1.34cm}{1.162cm}}
\pgfpathcurveto{\pgfqpoint{1.315cm}{1.187cm}}{\pgfqpoint{1.28cm}{1.202cm}}{\pgfqpoint{1.244cm}{1.202cm}}
\pgfpathcurveto{\pgfqpoint{1.207cm}{1.202cm}}{\pgfqpoint{1.173cm}{1.187cm}}{\pgfqpoint{1.147cm}{1.162cm}}
\pgfpathcurveto{\pgfqpoint{1.121cm}{1.136cm}}{\pgfqpoint{1.107cm}{1.101cm}}{\pgfqpoint{1.107cm}{1.065cm}}
\pgfpathcurveto{\pgfqpoint{1.107cm}{1.029cm}}{\pgfqpoint{1.121cm}{0.994cm}}{\pgfqpoint{1.147cm}{0.968cm}}
\pgfpathcurveto{\pgfqpoint{1.173cm}{0.942cm}}{\pgfqpoint{1.207cm}{0.928cm}}{\pgfqpoint{1.244cm}{0.928cm}}
\pgfpathcurveto{\pgfqpoint{1.28cm}{0.928cm}}{\pgfqpoint{1.315cm}{0.942cm}}{\pgfqpoint{1.34cm}{0.968cm}}
\pgfpathcurveto{\pgfqpoint{1.366cm}{0.994cm}}{\pgfqpoint{1.38cm}{1.029cm}}{\pgfqpoint{1.38cm}{1.065cm}}
\pgfusepath{fill}
\begin{pgfscope}
\pgfsetdash{}{0cm}
\pgfsetlinewidth{0.818mm}
\pgfsetmiterlimit{4.0}
\pgfpathmoveto{\pgfqpoint{1.383cm}{0.178cm}}
\pgfpathcurveto{\pgfqpoint{1.383cm}{0.214cm}}{\pgfqpoint{1.369cm}{0.249cm}}{\pgfqpoint{1.343cm}{0.275cm}}
\pgfpathcurveto{\pgfqpoint{1.317cm}{0.3cm}}{\pgfqpoint{1.283cm}{0.315cm}}{\pgfqpoint{1.246cm}{0.315cm}}
\pgfpathcurveto{\pgfqpoint{1.21cm}{0.315cm}}{\pgfqpoint{1.175cm}{0.3cm}}{\pgfqpoint{1.15cm}{0.275cm}}
\pgfpathcurveto{\pgfqpoint{1.124cm}{0.249cm}}{\pgfqpoint{1.11cm}{0.214cm}}{\pgfqpoint{1.11cm}{0.178cm}}
\pgfpathcurveto{\pgfqpoint{1.11cm}{0.141cm}}{\pgfqpoint{1.124cm}{0.107cm}}{\pgfqpoint{1.15cm}{0.081cm}}
\pgfpathcurveto{\pgfqpoint{1.175cm}{0.055cm}}{\pgfqpoint{1.21cm}{0.041cm}}{\pgfqpoint{1.246cm}{0.041cm}}
\pgfpathcurveto{\pgfqpoint{1.283cm}{0.041cm}}{\pgfqpoint{1.317cm}{0.055cm}}{\pgfqpoint{1.343cm}{0.081cm}}
\pgfpathcurveto{\pgfqpoint{1.369cm}{0.107cm}}{\pgfqpoint{1.383cm}{0.141cm}}{\pgfqpoint{1.383cm}{0.178cm}}
\pgfusepath{stroke}
\end{pgfscope}
\end{pgfscope}
\end{pgfscope}
\end{pgfscope}
\end{tikzpicture}}}\|_{\CC^\gamma(\rho^{3+\gamma})}\lesssim \|\phi+\psi\|_{\CC^{\gamma+\kappa}(\rho^{2+\gamma+\kappa})}\lesssim 1+\|\psi\|_{\CC^{1+\alpha}(\rho^{2+\alpha})},
\end{align*}
\begin{align*}
\|6\UU_\leqslant X\succ(X^{\!\resizebox{0.6em}{!}{
\begin{tikzpicture}
\pgfpathmoveto{\pgfqpoint{0cm}{-0.035cm}}
\pgfpathlineto{\pgfqpoint{1.376cm}{-0.035cm}}
\pgfpathlineto{\pgfqpoint{1.376cm}{1.552cm}}
\pgfpathlineto{\pgfqpoint{0cm}{1.552cm}}
\pgfpathclose
\pgfusepath{clip}
\begin{pgfscope}
\begin{pgfscope}
\pgfpathmoveto{\pgfqpoint{0cm}{-0.035cm}}
\pgfpathlineto{\pgfqpoint{1.376cm}{-0.035cm}}
\pgfpathlineto{\pgfqpoint{1.376cm}{1.552cm}}
\pgfpathlineto{\pgfqpoint{0cm}{1.552cm}}
\pgfpathclose
\pgfusepath{clip}
\begin{pgfscope}
\begin{pgfscope}
\pgfsetdash{}{0cm}
\pgfsetlinewidth{0.818mm}
\pgfsetroundcap
\pgfsetroundjoin
\pgfsetmiterlimit{7.0}
\definecolor{eps2pgf_color}{gray}{0}\pgfsetstrokecolor{eps2pgf_color}\pgfsetfillcolor{eps2pgf_color}
\pgfpathmoveto{\pgfqpoint{0.117cm}{1.421cm}}
\pgfpathlineto{\pgfqpoint{0.682cm}{0.671cm}}
\pgfpathlineto{\pgfqpoint{1.246cm}{1.421cm}}
\pgfusepath{stroke}
\end{pgfscope}
\definecolor{eps2pgf_color}{gray}{0}\pgfsetstrokecolor{eps2pgf_color}\pgfsetfillcolor{eps2pgf_color}
\pgfpathmoveto{\pgfqpoint{0.273cm}{1.395cm}}
\pgfpathcurveto{\pgfqpoint{0.273cm}{1.432cm}}{\pgfqpoint{0.259cm}{1.467cm}}{\pgfqpoint{0.233cm}{1.492cm}}
\pgfpathcurveto{\pgfqpoint{0.207cm}{1.518cm}}{\pgfqpoint{0.173cm}{1.532cm}}{\pgfqpoint{0.137cm}{1.532cm}}
\pgfpathcurveto{\pgfqpoint{0.1cm}{1.532cm}}{\pgfqpoint{0.066cm}{1.518cm}}{\pgfqpoint{0.04cm}{1.492cm}}
\pgfpathcurveto{\pgfqpoint{0.014cm}{1.467cm}}{\pgfqpoint{0cm}{1.432cm}}{\pgfqpoint{0cm}{1.395cm}}
\pgfpathcurveto{\pgfqpoint{0cm}{1.359cm}}{\pgfqpoint{0.014cm}{1.324cm}}{\pgfqpoint{0.04cm}{1.299cm}}
\pgfpathcurveto{\pgfqpoint{0.066cm}{1.273cm}}{\pgfqpoint{0.1cm}{1.258cm}}{\pgfqpoint{0.137cm}{1.258cm}}
\pgfpathcurveto{\pgfqpoint{0.173cm}{1.258cm}}{\pgfqpoint{0.207cm}{1.273cm}}{\pgfqpoint{0.233cm}{1.299cm}}
\pgfpathcurveto{\pgfqpoint{0.259cm}{1.324cm}}{\pgfqpoint{0.273cm}{1.359cm}}{\pgfqpoint{0.273cm}{1.395cm}}
\pgfusepath{fill}
\begin{pgfscope}
\pgfsetdash{}{0cm}
\pgfsetlinewidth{0.818mm}
\pgfsetmiterlimit{7.0}
\pgfpathmoveto{\pgfqpoint{0.682cm}{0.671cm}}
\pgfpathlineto{\pgfqpoint{0.679cm}{1.418cm}}
\pgfusepath{stroke}
\end{pgfscope}
\pgfpathmoveto{\pgfqpoint{0.815cm}{1.399cm}}
\pgfpathcurveto{\pgfqpoint{0.815cm}{1.435cm}}{\pgfqpoint{0.801cm}{1.47cm}}{\pgfqpoint{0.775cm}{1.496cm}}
\pgfpathcurveto{\pgfqpoint{0.75cm}{1.521cm}}{\pgfqpoint{0.715cm}{1.536cm}}{\pgfqpoint{0.679cm}{1.536cm}}
\pgfpathcurveto{\pgfqpoint{0.643cm}{1.536cm}}{\pgfqpoint{0.608cm}{1.521cm}}{\pgfqpoint{0.582cm}{1.496cm}}
\pgfpathcurveto{\pgfqpoint{0.557cm}{1.47cm}}{\pgfqpoint{0.542cm}{1.435cm}}{\pgfqpoint{0.542cm}{1.399cm}}
\pgfpathcurveto{\pgfqpoint{0.542cm}{1.363cm}}{\pgfqpoint{0.557cm}{1.328cm}}{\pgfqpoint{0.582cm}{1.302cm}}
\pgfpathcurveto{\pgfqpoint{0.608cm}{1.276cm}}{\pgfqpoint{0.643cm}{1.262cm}}{\pgfqpoint{0.679cm}{1.262cm}}
\pgfpathcurveto{\pgfqpoint{0.715cm}{1.262cm}}{\pgfqpoint{0.75cm}{1.276cm}}{\pgfqpoint{0.775cm}{1.302cm}}
\pgfpathcurveto{\pgfqpoint{0.801cm}{1.328cm}}{\pgfqpoint{0.815cm}{1.363cm}}{\pgfqpoint{0.815cm}{1.399cm}}
\pgfusepath{fill}
\pgfpathmoveto{\pgfqpoint{1.345cm}{1.371cm}}
\pgfpathcurveto{\pgfqpoint{1.345cm}{1.408cm}}{\pgfqpoint{1.331cm}{1.442cm}}{\pgfqpoint{1.305cm}{1.468cm}}
\pgfpathcurveto{\pgfqpoint{1.28cm}{1.494cm}}{\pgfqpoint{1.245cm}{1.508cm}}{\pgfqpoint{1.209cm}{1.508cm}}
\pgfpathcurveto{\pgfqpoint{1.172cm}{1.508cm}}{\pgfqpoint{1.138cm}{1.494cm}}{\pgfqpoint{1.112cm}{1.468cm}}
\pgfpathcurveto{\pgfqpoint{1.087cm}{1.442cm}}{\pgfqpoint{1.072cm}{1.408cm}}{\pgfqpoint{1.072cm}{1.371cm}}
\pgfpathcurveto{\pgfqpoint{1.072cm}{1.335cm}}{\pgfqpoint{1.087cm}{1.3cm}}{\pgfqpoint{1.112cm}{1.274cm}}
\pgfpathcurveto{\pgfqpoint{1.138cm}{1.249cm}}{\pgfqpoint{1.172cm}{1.234cm}}{\pgfqpoint{1.209cm}{1.234cm}}
\pgfpathcurveto{\pgfqpoint{1.245cm}{1.234cm}}{\pgfqpoint{1.28cm}{1.249cm}}{\pgfqpoint{1.305cm}{1.274cm}}
\pgfpathcurveto{\pgfqpoint{1.331cm}{1.3cm}}{\pgfqpoint{1.345cm}{1.335cm}}{\pgfqpoint{1.345cm}{1.371cm}}
\pgfusepath{fill}
\begin{pgfscope}
\pgfsetdash{}{0cm}
\pgfsetlinewidth{0.818mm}
\pgfsetroundcap
\pgfsetmiterlimit{4.0}
\pgfpathmoveto{\pgfqpoint{0.682cm}{0.671cm}}
\pgfpathlineto{\pgfqpoint{0.682cm}{0.042cm}}
\pgfusepath{stroke}
\end{pgfscope}
\end{pgfscope}
\end{pgfscope}
\end{pgfscope}
\end{tikzpicture}}}(\phi+\psi))\|_{\CC^\gamma(\rho^{3+\gamma})}&\lesssim \|\phi+\psi\|_{L^\infty(\rho)}\|\UU_\leqslant X\|_{\CC^\gamma(\rho^{2})}\lesssim 2^{(\frac{1}{2}+\gamma+\kappa)\frac{2}{3}K}\|\phi+\psi\|_{L^\infty(\rho)}\\
&\lesssim 1+\|\psi\|_{L^\infty(\rho)}^{1+\varepsilon},
\end{align*}
\begin{align*}
\|6\UU_\leqslant X\prec(X^{\!\resizebox{0.6em}{!}{
\begin{tikzpicture}
\pgfpathmoveto{\pgfqpoint{0cm}{-0.035cm}}
\pgfpathlineto{\pgfqpoint{1.376cm}{-0.035cm}}
\pgfpathlineto{\pgfqpoint{1.376cm}{1.552cm}}
\pgfpathlineto{\pgfqpoint{0cm}{1.552cm}}
\pgfpathclose
\pgfusepath{clip}
\begin{pgfscope}
\begin{pgfscope}
\pgfpathmoveto{\pgfqpoint{0cm}{-0.035cm}}
\pgfpathlineto{\pgfqpoint{1.376cm}{-0.035cm}}
\pgfpathlineto{\pgfqpoint{1.376cm}{1.552cm}}
\pgfpathlineto{\pgfqpoint{0cm}{1.552cm}}
\pgfpathclose
\pgfusepath{clip}
\begin{pgfscope}
\begin{pgfscope}
\pgfsetdash{}{0cm}
\pgfsetlinewidth{0.818mm}
\pgfsetroundcap
\pgfsetroundjoin
\pgfsetmiterlimit{7.0}
\definecolor{eps2pgf_color}{gray}{0}\pgfsetstrokecolor{eps2pgf_color}\pgfsetfillcolor{eps2pgf_color}
\pgfpathmoveto{\pgfqpoint{0.117cm}{1.421cm}}
\pgfpathlineto{\pgfqpoint{0.682cm}{0.671cm}}
\pgfpathlineto{\pgfqpoint{1.246cm}{1.421cm}}
\pgfusepath{stroke}
\end{pgfscope}
\definecolor{eps2pgf_color}{gray}{0}\pgfsetstrokecolor{eps2pgf_color}\pgfsetfillcolor{eps2pgf_color}
\pgfpathmoveto{\pgfqpoint{0.273cm}{1.395cm}}
\pgfpathcurveto{\pgfqpoint{0.273cm}{1.432cm}}{\pgfqpoint{0.259cm}{1.467cm}}{\pgfqpoint{0.233cm}{1.492cm}}
\pgfpathcurveto{\pgfqpoint{0.207cm}{1.518cm}}{\pgfqpoint{0.173cm}{1.532cm}}{\pgfqpoint{0.137cm}{1.532cm}}
\pgfpathcurveto{\pgfqpoint{0.1cm}{1.532cm}}{\pgfqpoint{0.066cm}{1.518cm}}{\pgfqpoint{0.04cm}{1.492cm}}
\pgfpathcurveto{\pgfqpoint{0.014cm}{1.467cm}}{\pgfqpoint{0cm}{1.432cm}}{\pgfqpoint{0cm}{1.395cm}}
\pgfpathcurveto{\pgfqpoint{0cm}{1.359cm}}{\pgfqpoint{0.014cm}{1.324cm}}{\pgfqpoint{0.04cm}{1.299cm}}
\pgfpathcurveto{\pgfqpoint{0.066cm}{1.273cm}}{\pgfqpoint{0.1cm}{1.258cm}}{\pgfqpoint{0.137cm}{1.258cm}}
\pgfpathcurveto{\pgfqpoint{0.173cm}{1.258cm}}{\pgfqpoint{0.207cm}{1.273cm}}{\pgfqpoint{0.233cm}{1.299cm}}
\pgfpathcurveto{\pgfqpoint{0.259cm}{1.324cm}}{\pgfqpoint{0.273cm}{1.359cm}}{\pgfqpoint{0.273cm}{1.395cm}}
\pgfusepath{fill}
\begin{pgfscope}
\pgfsetdash{}{0cm}
\pgfsetlinewidth{0.818mm}
\pgfsetmiterlimit{7.0}
\pgfpathmoveto{\pgfqpoint{0.682cm}{0.671cm}}
\pgfpathlineto{\pgfqpoint{0.679cm}{1.418cm}}
\pgfusepath{stroke}
\end{pgfscope}
\pgfpathmoveto{\pgfqpoint{0.815cm}{1.399cm}}
\pgfpathcurveto{\pgfqpoint{0.815cm}{1.435cm}}{\pgfqpoint{0.801cm}{1.47cm}}{\pgfqpoint{0.775cm}{1.496cm}}
\pgfpathcurveto{\pgfqpoint{0.75cm}{1.521cm}}{\pgfqpoint{0.715cm}{1.536cm}}{\pgfqpoint{0.679cm}{1.536cm}}
\pgfpathcurveto{\pgfqpoint{0.643cm}{1.536cm}}{\pgfqpoint{0.608cm}{1.521cm}}{\pgfqpoint{0.582cm}{1.496cm}}
\pgfpathcurveto{\pgfqpoint{0.557cm}{1.47cm}}{\pgfqpoint{0.542cm}{1.435cm}}{\pgfqpoint{0.542cm}{1.399cm}}
\pgfpathcurveto{\pgfqpoint{0.542cm}{1.363cm}}{\pgfqpoint{0.557cm}{1.328cm}}{\pgfqpoint{0.582cm}{1.302cm}}
\pgfpathcurveto{\pgfqpoint{0.608cm}{1.276cm}}{\pgfqpoint{0.643cm}{1.262cm}}{\pgfqpoint{0.679cm}{1.262cm}}
\pgfpathcurveto{\pgfqpoint{0.715cm}{1.262cm}}{\pgfqpoint{0.75cm}{1.276cm}}{\pgfqpoint{0.775cm}{1.302cm}}
\pgfpathcurveto{\pgfqpoint{0.801cm}{1.328cm}}{\pgfqpoint{0.815cm}{1.363cm}}{\pgfqpoint{0.815cm}{1.399cm}}
\pgfusepath{fill}
\pgfpathmoveto{\pgfqpoint{1.345cm}{1.371cm}}
\pgfpathcurveto{\pgfqpoint{1.345cm}{1.408cm}}{\pgfqpoint{1.331cm}{1.442cm}}{\pgfqpoint{1.305cm}{1.468cm}}
\pgfpathcurveto{\pgfqpoint{1.28cm}{1.494cm}}{\pgfqpoint{1.245cm}{1.508cm}}{\pgfqpoint{1.209cm}{1.508cm}}
\pgfpathcurveto{\pgfqpoint{1.172cm}{1.508cm}}{\pgfqpoint{1.138cm}{1.494cm}}{\pgfqpoint{1.112cm}{1.468cm}}
\pgfpathcurveto{\pgfqpoint{1.087cm}{1.442cm}}{\pgfqpoint{1.072cm}{1.408cm}}{\pgfqpoint{1.072cm}{1.371cm}}
\pgfpathcurveto{\pgfqpoint{1.072cm}{1.335cm}}{\pgfqpoint{1.087cm}{1.3cm}}{\pgfqpoint{1.112cm}{1.274cm}}
\pgfpathcurveto{\pgfqpoint{1.138cm}{1.249cm}}{\pgfqpoint{1.172cm}{1.234cm}}{\pgfqpoint{1.209cm}{1.234cm}}
\pgfpathcurveto{\pgfqpoint{1.245cm}{1.234cm}}{\pgfqpoint{1.28cm}{1.249cm}}{\pgfqpoint{1.305cm}{1.274cm}}
\pgfpathcurveto{\pgfqpoint{1.331cm}{1.3cm}}{\pgfqpoint{1.345cm}{1.335cm}}{\pgfqpoint{1.345cm}{1.371cm}}
\pgfusepath{fill}
\begin{pgfscope}
\pgfsetdash{}{0cm}
\pgfsetlinewidth{0.818mm}
\pgfsetroundcap
\pgfsetmiterlimit{4.0}
\pgfpathmoveto{\pgfqpoint{0.682cm}{0.671cm}}
\pgfpathlineto{\pgfqpoint{0.682cm}{0.042cm}}
\pgfusepath{stroke}
\end{pgfscope}
\end{pgfscope}
\end{pgfscope}
\end{pgfscope}
\end{tikzpicture}}}(\phi+\psi))\|_{\CC^\gamma(\rho^{3+\gamma})}&\lesssim \|\UU_{\leq}X\|_{\CC^{-\frac{1}{2}-\kappa}(\rho^{\sigma})}\|\phi+\psi\|_{\CC^{\frac{1}{2}+\gamma+\kappa}(\rho^{2+\gamma+\kappa})}\lesssim \|\phi+\psi\|_{\CC^{\frac{1}{2}+\alpha}(\rho^{2+\alpha})}\\
&\lesssim 1+\|\psi\|_{L^\infty(\rho)}^\varepsilon+ \|\psi\|_{\CC^{1+\alpha}(\rho^{2+\alpha})},
\end{align*}
\begin{align*}
\|6(\phi+\psi)\prec\UU_{\leqslant}X^{\!\resizebox{!}{.8em}{
\begin{tikzpicture}
\pgfpathmoveto{\pgfqpoint{0cm}{-0.035cm}}
\pgfpathlineto{\pgfqpoint{1.976cm}{-0.035cm}}
\pgfpathlineto{\pgfqpoint{1.976cm}{1.94cm}}
\pgfpathlineto{\pgfqpoint{0cm}{1.94cm}}
\pgfpathclose
\pgfusepath{clip}
\begin{pgfscope}
\begin{pgfscope}
\pgfpathmoveto{\pgfqpoint{0cm}{-0.035cm}}
\pgfpathlineto{\pgfqpoint{1.976cm}{-0.035cm}}
\pgfpathlineto{\pgfqpoint{1.976cm}{1.94cm}}
\pgfpathlineto{\pgfqpoint{0cm}{1.94cm}}
\pgfpathclose
\pgfusepath{clip}
\begin{pgfscope}
\begin{pgfscope}
\pgfsetdash{}{0cm}
\pgfsetlinewidth{0.818mm}
\pgfsetroundcap
\pgfsetroundjoin
\pgfsetmiterlimit{7.0}
\definecolor{eps2pgf_color}{gray}{0}\pgfsetstrokecolor{eps2pgf_color}\pgfsetfillcolor{eps2pgf_color}
\pgfpathmoveto{\pgfqpoint{0.117cm}{1.815cm}}
\pgfpathlineto{\pgfqpoint{0.682cm}{1.065cm}}
\pgfpathlineto{\pgfqpoint{1.246cm}{1.815cm}}
\pgfusepath{stroke}
\end{pgfscope}
\definecolor{eps2pgf_color}{gray}{0}\pgfsetstrokecolor{eps2pgf_color}\pgfsetfillcolor{eps2pgf_color}
\pgfpathmoveto{\pgfqpoint{0.273cm}{1.789cm}}
\pgfpathcurveto{\pgfqpoint{0.273cm}{1.825cm}}{\pgfqpoint{0.259cm}{1.86cm}}{\pgfqpoint{0.233cm}{1.886cm}}
\pgfpathcurveto{\pgfqpoint{0.207cm}{1.912cm}}{\pgfqpoint{0.173cm}{1.926cm}}{\pgfqpoint{0.137cm}{1.926cm}}
\pgfpathcurveto{\pgfqpoint{0.1cm}{1.926cm}}{\pgfqpoint{0.066cm}{1.912cm}}{\pgfqpoint{0.04cm}{1.886cm}}
\pgfpathcurveto{\pgfqpoint{0.014cm}{1.86cm}}{\pgfqpoint{0cm}{1.825cm}}{\pgfqpoint{0cm}{1.789cm}}
\pgfpathcurveto{\pgfqpoint{0cm}{1.753cm}}{\pgfqpoint{0.014cm}{1.718cm}}{\pgfqpoint{0.04cm}{1.692cm}}
\pgfpathcurveto{\pgfqpoint{0.066cm}{1.667cm}}{\pgfqpoint{0.1cm}{1.652cm}}{\pgfqpoint{0.137cm}{1.652cm}}
\pgfpathcurveto{\pgfqpoint{0.173cm}{1.652cm}}{\pgfqpoint{0.207cm}{1.667cm}}{\pgfqpoint{0.233cm}{1.692cm}}
\pgfpathcurveto{\pgfqpoint{0.259cm}{1.718cm}}{\pgfqpoint{0.273cm}{1.753cm}}{\pgfqpoint{0.273cm}{1.789cm}}
\pgfusepath{fill}
\begin{pgfscope}
\pgfsetdash{}{0cm}
\pgfsetlinewidth{0.818mm}
\pgfsetmiterlimit{7.0}
\pgfpathmoveto{\pgfqpoint{0.682cm}{1.065cm}}
\pgfpathlineto{\pgfqpoint{0.679cm}{1.812cm}}
\pgfusepath{stroke}
\end{pgfscope}
\pgfpathmoveto{\pgfqpoint{0.815cm}{1.793cm}}
\pgfpathcurveto{\pgfqpoint{0.815cm}{1.829cm}}{\pgfqpoint{0.801cm}{1.864cm}}{\pgfqpoint{0.775cm}{1.89cm}}
\pgfpathcurveto{\pgfqpoint{0.75cm}{1.915cm}}{\pgfqpoint{0.715cm}{1.93cm}}{\pgfqpoint{0.679cm}{1.93cm}}
\pgfpathcurveto{\pgfqpoint{0.643cm}{1.93cm}}{\pgfqpoint{0.608cm}{1.915cm}}{\pgfqpoint{0.582cm}{1.89cm}}
\pgfpathcurveto{\pgfqpoint{0.557cm}{1.864cm}}{\pgfqpoint{0.542cm}{1.829cm}}{\pgfqpoint{0.542cm}{1.793cm}}
\pgfpathcurveto{\pgfqpoint{0.542cm}{1.756cm}}{\pgfqpoint{0.557cm}{1.722cm}}{\pgfqpoint{0.582cm}{1.696cm}}
\pgfpathcurveto{\pgfqpoint{0.608cm}{1.67cm}}{\pgfqpoint{0.643cm}{1.656cm}}{\pgfqpoint{0.679cm}{1.656cm}}
\pgfpathcurveto{\pgfqpoint{0.715cm}{1.656cm}}{\pgfqpoint{0.75cm}{1.67cm}}{\pgfqpoint{0.775cm}{1.696cm}}
\pgfpathcurveto{\pgfqpoint{0.801cm}{1.722cm}}{\pgfqpoint{0.815cm}{1.756cm}}{\pgfqpoint{0.815cm}{1.793cm}}
\pgfusepath{fill}
\pgfpathmoveto{\pgfqpoint{1.345cm}{1.765cm}}
\pgfpathcurveto{\pgfqpoint{1.345cm}{1.801cm}}{\pgfqpoint{1.331cm}{1.836cm}}{\pgfqpoint{1.305cm}{1.862cm}}
\pgfpathcurveto{\pgfqpoint{1.28cm}{1.887cm}}{\pgfqpoint{1.245cm}{1.902cm}}{\pgfqpoint{1.209cm}{1.902cm}}
\pgfpathcurveto{\pgfqpoint{1.172cm}{1.902cm}}{\pgfqpoint{1.138cm}{1.887cm}}{\pgfqpoint{1.112cm}{1.862cm}}
\pgfpathcurveto{\pgfqpoint{1.087cm}{1.836cm}}{\pgfqpoint{1.072cm}{1.801cm}}{\pgfqpoint{1.072cm}{1.765cm}}
\pgfpathcurveto{\pgfqpoint{1.072cm}{1.728cm}}{\pgfqpoint{1.087cm}{1.694cm}}{\pgfqpoint{1.112cm}{1.668cm}}
\pgfpathcurveto{\pgfqpoint{1.138cm}{1.642cm}}{\pgfqpoint{1.172cm}{1.628cm}}{\pgfqpoint{1.209cm}{1.628cm}}
\pgfpathcurveto{\pgfqpoint{1.245cm}{1.628cm}}{\pgfqpoint{1.28cm}{1.642cm}}{\pgfqpoint{1.305cm}{1.668cm}}
\pgfpathcurveto{\pgfqpoint{1.331cm}{1.694cm}}{\pgfqpoint{1.345cm}{1.728cm}}{\pgfqpoint{1.345cm}{1.765cm}}
\pgfusepath{fill}
\begin{pgfscope}
\pgfsetdash{}{0cm}
\pgfsetlinewidth{0.818mm}
\pgfsetroundcap
\pgfsetroundjoin
\pgfsetmiterlimit{7.0}
\pgfpathmoveto{\pgfqpoint{0.682cm}{1.065cm}}
\pgfpathlineto{\pgfqpoint{1.246cm}{0.315cm}}
\pgfpathlineto{\pgfqpoint{1.811cm}{1.065cm}}
\pgfusepath{stroke}
\end{pgfscope}
\pgfpathmoveto{\pgfqpoint{1.948cm}{1.065cm}}
\pgfpathcurveto{\pgfqpoint{1.948cm}{1.101cm}}{\pgfqpoint{1.933cm}{1.136cm}}{\pgfqpoint{1.907cm}{1.162cm}}
\pgfpathcurveto{\pgfqpoint{1.882cm}{1.187cm}}{\pgfqpoint{1.847cm}{1.202cm}}{\pgfqpoint{1.811cm}{1.202cm}}
\pgfpathcurveto{\pgfqpoint{1.775cm}{1.202cm}}{\pgfqpoint{1.74cm}{1.187cm}}{\pgfqpoint{1.714cm}{1.162cm}}
\pgfpathcurveto{\pgfqpoint{1.689cm}{1.136cm}}{\pgfqpoint{1.674cm}{1.101cm}}{\pgfqpoint{1.674cm}{1.065cm}}
\pgfpathcurveto{\pgfqpoint{1.674cm}{1.029cm}}{\pgfqpoint{1.689cm}{0.994cm}}{\pgfqpoint{1.714cm}{0.968cm}}
\pgfpathcurveto{\pgfqpoint{1.74cm}{0.942cm}}{\pgfqpoint{1.775cm}{0.928cm}}{\pgfqpoint{1.811cm}{0.928cm}}
\pgfpathcurveto{\pgfqpoint{1.847cm}{0.928cm}}{\pgfqpoint{1.882cm}{0.942cm}}{\pgfqpoint{1.907cm}{0.968cm}}
\pgfpathcurveto{\pgfqpoint{1.933cm}{0.994cm}}{\pgfqpoint{1.948cm}{1.029cm}}{\pgfqpoint{1.948cm}{1.065cm}}
\pgfusepath{fill}
\begin{pgfscope}
\pgfsetdash{}{0cm}
\pgfsetlinewidth{0.818mm}
\pgfsetmiterlimit{4.0}
\pgfpathmoveto{\pgfqpoint{1.383cm}{0.178cm}}
\pgfpathcurveto{\pgfqpoint{1.383cm}{0.214cm}}{\pgfqpoint{1.369cm}{0.249cm}}{\pgfqpoint{1.343cm}{0.275cm}}
\pgfpathcurveto{\pgfqpoint{1.317cm}{0.3cm}}{\pgfqpoint{1.283cm}{0.315cm}}{\pgfqpoint{1.246cm}{0.315cm}}
\pgfpathcurveto{\pgfqpoint{1.21cm}{0.315cm}}{\pgfqpoint{1.175cm}{0.3cm}}{\pgfqpoint{1.15cm}{0.275cm}}
\pgfpathcurveto{\pgfqpoint{1.124cm}{0.249cm}}{\pgfqpoint{1.11cm}{0.214cm}}{\pgfqpoint{1.11cm}{0.178cm}}
\pgfpathcurveto{\pgfqpoint{1.11cm}{0.141cm}}{\pgfqpoint{1.124cm}{0.107cm}}{\pgfqpoint{1.15cm}{0.081cm}}
\pgfpathcurveto{\pgfqpoint{1.175cm}{0.055cm}}{\pgfqpoint{1.21cm}{0.041cm}}{\pgfqpoint{1.246cm}{0.041cm}}
\pgfpathcurveto{\pgfqpoint{1.283cm}{0.041cm}}{\pgfqpoint{1.317cm}{0.055cm}}{\pgfqpoint{1.343cm}{0.081cm}}
\pgfpathcurveto{\pgfqpoint{1.369cm}{0.107cm}}{\pgfqpoint{1.383cm}{0.141cm}}{\pgfqpoint{1.383cm}{0.178cm}}
\pgfusepath{stroke}
\end{pgfscope}
\end{pgfscope}
\end{pgfscope}
\end{pgfscope}
\end{tikzpicture}}}\|_{\CC^\gamma(\rho^{3+\gamma})}&\lesssim \|\phi+\psi\|_{L^\infty(\rho)}\|\UU_\leqslant X^{\!\resizebox{!}{.8em}{
\begin{tikzpicture}
\pgfpathmoveto{\pgfqpoint{0cm}{-0.035cm}}
\pgfpathlineto{\pgfqpoint{1.976cm}{-0.035cm}}
\pgfpathlineto{\pgfqpoint{1.976cm}{1.94cm}}
\pgfpathlineto{\pgfqpoint{0cm}{1.94cm}}
\pgfpathclose
\pgfusepath{clip}
\begin{pgfscope}
\begin{pgfscope}
\pgfpathmoveto{\pgfqpoint{0cm}{-0.035cm}}
\pgfpathlineto{\pgfqpoint{1.976cm}{-0.035cm}}
\pgfpathlineto{\pgfqpoint{1.976cm}{1.94cm}}
\pgfpathlineto{\pgfqpoint{0cm}{1.94cm}}
\pgfpathclose
\pgfusepath{clip}
\begin{pgfscope}
\begin{pgfscope}
\pgfsetdash{}{0cm}
\pgfsetlinewidth{0.818mm}
\pgfsetroundcap
\pgfsetroundjoin
\pgfsetmiterlimit{7.0}
\definecolor{eps2pgf_color}{gray}{0}\pgfsetstrokecolor{eps2pgf_color}\pgfsetfillcolor{eps2pgf_color}
\pgfpathmoveto{\pgfqpoint{0.117cm}{1.815cm}}
\pgfpathlineto{\pgfqpoint{0.682cm}{1.065cm}}
\pgfpathlineto{\pgfqpoint{1.246cm}{1.815cm}}
\pgfusepath{stroke}
\end{pgfscope}
\definecolor{eps2pgf_color}{gray}{0}\pgfsetstrokecolor{eps2pgf_color}\pgfsetfillcolor{eps2pgf_color}
\pgfpathmoveto{\pgfqpoint{0.273cm}{1.789cm}}
\pgfpathcurveto{\pgfqpoint{0.273cm}{1.825cm}}{\pgfqpoint{0.259cm}{1.86cm}}{\pgfqpoint{0.233cm}{1.886cm}}
\pgfpathcurveto{\pgfqpoint{0.207cm}{1.912cm}}{\pgfqpoint{0.173cm}{1.926cm}}{\pgfqpoint{0.137cm}{1.926cm}}
\pgfpathcurveto{\pgfqpoint{0.1cm}{1.926cm}}{\pgfqpoint{0.066cm}{1.912cm}}{\pgfqpoint{0.04cm}{1.886cm}}
\pgfpathcurveto{\pgfqpoint{0.014cm}{1.86cm}}{\pgfqpoint{0cm}{1.825cm}}{\pgfqpoint{0cm}{1.789cm}}
\pgfpathcurveto{\pgfqpoint{0cm}{1.753cm}}{\pgfqpoint{0.014cm}{1.718cm}}{\pgfqpoint{0.04cm}{1.692cm}}
\pgfpathcurveto{\pgfqpoint{0.066cm}{1.667cm}}{\pgfqpoint{0.1cm}{1.652cm}}{\pgfqpoint{0.137cm}{1.652cm}}
\pgfpathcurveto{\pgfqpoint{0.173cm}{1.652cm}}{\pgfqpoint{0.207cm}{1.667cm}}{\pgfqpoint{0.233cm}{1.692cm}}
\pgfpathcurveto{\pgfqpoint{0.259cm}{1.718cm}}{\pgfqpoint{0.273cm}{1.753cm}}{\pgfqpoint{0.273cm}{1.789cm}}
\pgfusepath{fill}
\begin{pgfscope}
\pgfsetdash{}{0cm}
\pgfsetlinewidth{0.818mm}
\pgfsetmiterlimit{7.0}
\pgfpathmoveto{\pgfqpoint{0.682cm}{1.065cm}}
\pgfpathlineto{\pgfqpoint{0.679cm}{1.812cm}}
\pgfusepath{stroke}
\end{pgfscope}
\pgfpathmoveto{\pgfqpoint{0.815cm}{1.793cm}}
\pgfpathcurveto{\pgfqpoint{0.815cm}{1.829cm}}{\pgfqpoint{0.801cm}{1.864cm}}{\pgfqpoint{0.775cm}{1.89cm}}
\pgfpathcurveto{\pgfqpoint{0.75cm}{1.915cm}}{\pgfqpoint{0.715cm}{1.93cm}}{\pgfqpoint{0.679cm}{1.93cm}}
\pgfpathcurveto{\pgfqpoint{0.643cm}{1.93cm}}{\pgfqpoint{0.608cm}{1.915cm}}{\pgfqpoint{0.582cm}{1.89cm}}
\pgfpathcurveto{\pgfqpoint{0.557cm}{1.864cm}}{\pgfqpoint{0.542cm}{1.829cm}}{\pgfqpoint{0.542cm}{1.793cm}}
\pgfpathcurveto{\pgfqpoint{0.542cm}{1.756cm}}{\pgfqpoint{0.557cm}{1.722cm}}{\pgfqpoint{0.582cm}{1.696cm}}
\pgfpathcurveto{\pgfqpoint{0.608cm}{1.67cm}}{\pgfqpoint{0.643cm}{1.656cm}}{\pgfqpoint{0.679cm}{1.656cm}}
\pgfpathcurveto{\pgfqpoint{0.715cm}{1.656cm}}{\pgfqpoint{0.75cm}{1.67cm}}{\pgfqpoint{0.775cm}{1.696cm}}
\pgfpathcurveto{\pgfqpoint{0.801cm}{1.722cm}}{\pgfqpoint{0.815cm}{1.756cm}}{\pgfqpoint{0.815cm}{1.793cm}}
\pgfusepath{fill}
\pgfpathmoveto{\pgfqpoint{1.345cm}{1.765cm}}
\pgfpathcurveto{\pgfqpoint{1.345cm}{1.801cm}}{\pgfqpoint{1.331cm}{1.836cm}}{\pgfqpoint{1.305cm}{1.862cm}}
\pgfpathcurveto{\pgfqpoint{1.28cm}{1.887cm}}{\pgfqpoint{1.245cm}{1.902cm}}{\pgfqpoint{1.209cm}{1.902cm}}
\pgfpathcurveto{\pgfqpoint{1.172cm}{1.902cm}}{\pgfqpoint{1.138cm}{1.887cm}}{\pgfqpoint{1.112cm}{1.862cm}}
\pgfpathcurveto{\pgfqpoint{1.087cm}{1.836cm}}{\pgfqpoint{1.072cm}{1.801cm}}{\pgfqpoint{1.072cm}{1.765cm}}
\pgfpathcurveto{\pgfqpoint{1.072cm}{1.728cm}}{\pgfqpoint{1.087cm}{1.694cm}}{\pgfqpoint{1.112cm}{1.668cm}}
\pgfpathcurveto{\pgfqpoint{1.138cm}{1.642cm}}{\pgfqpoint{1.172cm}{1.628cm}}{\pgfqpoint{1.209cm}{1.628cm}}
\pgfpathcurveto{\pgfqpoint{1.245cm}{1.628cm}}{\pgfqpoint{1.28cm}{1.642cm}}{\pgfqpoint{1.305cm}{1.668cm}}
\pgfpathcurveto{\pgfqpoint{1.331cm}{1.694cm}}{\pgfqpoint{1.345cm}{1.728cm}}{\pgfqpoint{1.345cm}{1.765cm}}
\pgfusepath{fill}
\begin{pgfscope}
\pgfsetdash{}{0cm}
\pgfsetlinewidth{0.818mm}
\pgfsetroundcap
\pgfsetroundjoin
\pgfsetmiterlimit{7.0}
\pgfpathmoveto{\pgfqpoint{0.682cm}{1.065cm}}
\pgfpathlineto{\pgfqpoint{1.246cm}{0.315cm}}
\pgfpathlineto{\pgfqpoint{1.811cm}{1.065cm}}
\pgfusepath{stroke}
\end{pgfscope}
\pgfpathmoveto{\pgfqpoint{1.948cm}{1.065cm}}
\pgfpathcurveto{\pgfqpoint{1.948cm}{1.101cm}}{\pgfqpoint{1.933cm}{1.136cm}}{\pgfqpoint{1.907cm}{1.162cm}}
\pgfpathcurveto{\pgfqpoint{1.882cm}{1.187cm}}{\pgfqpoint{1.847cm}{1.202cm}}{\pgfqpoint{1.811cm}{1.202cm}}
\pgfpathcurveto{\pgfqpoint{1.775cm}{1.202cm}}{\pgfqpoint{1.74cm}{1.187cm}}{\pgfqpoint{1.714cm}{1.162cm}}
\pgfpathcurveto{\pgfqpoint{1.689cm}{1.136cm}}{\pgfqpoint{1.674cm}{1.101cm}}{\pgfqpoint{1.674cm}{1.065cm}}
\pgfpathcurveto{\pgfqpoint{1.674cm}{1.029cm}}{\pgfqpoint{1.689cm}{0.994cm}}{\pgfqpoint{1.714cm}{0.968cm}}
\pgfpathcurveto{\pgfqpoint{1.74cm}{0.942cm}}{\pgfqpoint{1.775cm}{0.928cm}}{\pgfqpoint{1.811cm}{0.928cm}}
\pgfpathcurveto{\pgfqpoint{1.847cm}{0.928cm}}{\pgfqpoint{1.882cm}{0.942cm}}{\pgfqpoint{1.907cm}{0.968cm}}
\pgfpathcurveto{\pgfqpoint{1.933cm}{0.994cm}}{\pgfqpoint{1.948cm}{1.029cm}}{\pgfqpoint{1.948cm}{1.065cm}}
\pgfusepath{fill}
\begin{pgfscope}
\pgfsetdash{}{0cm}
\pgfsetlinewidth{0.818mm}
\pgfsetmiterlimit{4.0}
\pgfpathmoveto{\pgfqpoint{1.383cm}{0.178cm}}
\pgfpathcurveto{\pgfqpoint{1.383cm}{0.214cm}}{\pgfqpoint{1.369cm}{0.249cm}}{\pgfqpoint{1.343cm}{0.275cm}}
\pgfpathcurveto{\pgfqpoint{1.317cm}{0.3cm}}{\pgfqpoint{1.283cm}{0.315cm}}{\pgfqpoint{1.246cm}{0.315cm}}
\pgfpathcurveto{\pgfqpoint{1.21cm}{0.315cm}}{\pgfqpoint{1.175cm}{0.3cm}}{\pgfqpoint{1.15cm}{0.275cm}}
\pgfpathcurveto{\pgfqpoint{1.124cm}{0.249cm}}{\pgfqpoint{1.11cm}{0.214cm}}{\pgfqpoint{1.11cm}{0.178cm}}
\pgfpathcurveto{\pgfqpoint{1.11cm}{0.141cm}}{\pgfqpoint{1.124cm}{0.107cm}}{\pgfqpoint{1.15cm}{0.081cm}}
\pgfpathcurveto{\pgfqpoint{1.175cm}{0.055cm}}{\pgfqpoint{1.21cm}{0.041cm}}{\pgfqpoint{1.246cm}{0.041cm}}
\pgfpathcurveto{\pgfqpoint{1.283cm}{0.041cm}}{\pgfqpoint{1.317cm}{0.055cm}}{\pgfqpoint{1.343cm}{0.081cm}}
\pgfpathcurveto{\pgfqpoint{1.369cm}{0.107cm}}{\pgfqpoint{1.383cm}{0.141cm}}{\pgfqpoint{1.383cm}{0.178cm}}
\pgfusepath{stroke}
\end{pgfscope}
\end{pgfscope}
\end{pgfscope}
\end{pgfscope}
\end{tikzpicture}}}\|_{\CC^\gamma(\rho^{2+\gamma})}\lesssim 2^{(\gamma+\kappa)K/2} \|\phi+\psi\|_{L^\infty(\rho)}\\
&\lesssim  1+\|\psi\|_{L^\infty(\rho)}^{1+\varepsilon},
\end{align*}
\begin{align*}
\|6(\phi+\psi)\succcurlyeqX^{\!\resizebox{!}{.8em}{
\begin{tikzpicture}
\pgfpathmoveto{\pgfqpoint{0cm}{-0.035cm}}
\pgfpathlineto{\pgfqpoint{1.976cm}{-0.035cm}}
\pgfpathlineto{\pgfqpoint{1.976cm}{1.94cm}}
\pgfpathlineto{\pgfqpoint{0cm}{1.94cm}}
\pgfpathclose
\pgfusepath{clip}
\begin{pgfscope}
\begin{pgfscope}
\pgfpathmoveto{\pgfqpoint{0cm}{-0.035cm}}
\pgfpathlineto{\pgfqpoint{1.976cm}{-0.035cm}}
\pgfpathlineto{\pgfqpoint{1.976cm}{1.94cm}}
\pgfpathlineto{\pgfqpoint{0cm}{1.94cm}}
\pgfpathclose
\pgfusepath{clip}
\begin{pgfscope}
\begin{pgfscope}
\pgfsetdash{}{0cm}
\pgfsetlinewidth{0.818mm}
\pgfsetroundcap
\pgfsetroundjoin
\pgfsetmiterlimit{7.0}
\definecolor{eps2pgf_color}{gray}{0}\pgfsetstrokecolor{eps2pgf_color}\pgfsetfillcolor{eps2pgf_color}
\pgfpathmoveto{\pgfqpoint{0.117cm}{1.815cm}}
\pgfpathlineto{\pgfqpoint{0.682cm}{1.065cm}}
\pgfpathlineto{\pgfqpoint{1.246cm}{1.815cm}}
\pgfusepath{stroke}
\end{pgfscope}
\definecolor{eps2pgf_color}{gray}{0}\pgfsetstrokecolor{eps2pgf_color}\pgfsetfillcolor{eps2pgf_color}
\pgfpathmoveto{\pgfqpoint{0.273cm}{1.789cm}}
\pgfpathcurveto{\pgfqpoint{0.273cm}{1.825cm}}{\pgfqpoint{0.259cm}{1.86cm}}{\pgfqpoint{0.233cm}{1.886cm}}
\pgfpathcurveto{\pgfqpoint{0.207cm}{1.912cm}}{\pgfqpoint{0.173cm}{1.926cm}}{\pgfqpoint{0.137cm}{1.926cm}}
\pgfpathcurveto{\pgfqpoint{0.1cm}{1.926cm}}{\pgfqpoint{0.066cm}{1.912cm}}{\pgfqpoint{0.04cm}{1.886cm}}
\pgfpathcurveto{\pgfqpoint{0.014cm}{1.86cm}}{\pgfqpoint{0cm}{1.825cm}}{\pgfqpoint{0cm}{1.789cm}}
\pgfpathcurveto{\pgfqpoint{0cm}{1.753cm}}{\pgfqpoint{0.014cm}{1.718cm}}{\pgfqpoint{0.04cm}{1.692cm}}
\pgfpathcurveto{\pgfqpoint{0.066cm}{1.667cm}}{\pgfqpoint{0.1cm}{1.652cm}}{\pgfqpoint{0.137cm}{1.652cm}}
\pgfpathcurveto{\pgfqpoint{0.173cm}{1.652cm}}{\pgfqpoint{0.207cm}{1.667cm}}{\pgfqpoint{0.233cm}{1.692cm}}
\pgfpathcurveto{\pgfqpoint{0.259cm}{1.718cm}}{\pgfqpoint{0.273cm}{1.753cm}}{\pgfqpoint{0.273cm}{1.789cm}}
\pgfusepath{fill}
\begin{pgfscope}
\pgfsetdash{}{0cm}
\pgfsetlinewidth{0.818mm}
\pgfsetmiterlimit{7.0}
\pgfpathmoveto{\pgfqpoint{0.682cm}{1.065cm}}
\pgfpathlineto{\pgfqpoint{0.679cm}{1.812cm}}
\pgfusepath{stroke}
\end{pgfscope}
\pgfpathmoveto{\pgfqpoint{0.815cm}{1.793cm}}
\pgfpathcurveto{\pgfqpoint{0.815cm}{1.829cm}}{\pgfqpoint{0.801cm}{1.864cm}}{\pgfqpoint{0.775cm}{1.89cm}}
\pgfpathcurveto{\pgfqpoint{0.75cm}{1.915cm}}{\pgfqpoint{0.715cm}{1.93cm}}{\pgfqpoint{0.679cm}{1.93cm}}
\pgfpathcurveto{\pgfqpoint{0.643cm}{1.93cm}}{\pgfqpoint{0.608cm}{1.915cm}}{\pgfqpoint{0.582cm}{1.89cm}}
\pgfpathcurveto{\pgfqpoint{0.557cm}{1.864cm}}{\pgfqpoint{0.542cm}{1.829cm}}{\pgfqpoint{0.542cm}{1.793cm}}
\pgfpathcurveto{\pgfqpoint{0.542cm}{1.756cm}}{\pgfqpoint{0.557cm}{1.722cm}}{\pgfqpoint{0.582cm}{1.696cm}}
\pgfpathcurveto{\pgfqpoint{0.608cm}{1.67cm}}{\pgfqpoint{0.643cm}{1.656cm}}{\pgfqpoint{0.679cm}{1.656cm}}
\pgfpathcurveto{\pgfqpoint{0.715cm}{1.656cm}}{\pgfqpoint{0.75cm}{1.67cm}}{\pgfqpoint{0.775cm}{1.696cm}}
\pgfpathcurveto{\pgfqpoint{0.801cm}{1.722cm}}{\pgfqpoint{0.815cm}{1.756cm}}{\pgfqpoint{0.815cm}{1.793cm}}
\pgfusepath{fill}
\pgfpathmoveto{\pgfqpoint{1.345cm}{1.765cm}}
\pgfpathcurveto{\pgfqpoint{1.345cm}{1.801cm}}{\pgfqpoint{1.331cm}{1.836cm}}{\pgfqpoint{1.305cm}{1.862cm}}
\pgfpathcurveto{\pgfqpoint{1.28cm}{1.887cm}}{\pgfqpoint{1.245cm}{1.902cm}}{\pgfqpoint{1.209cm}{1.902cm}}
\pgfpathcurveto{\pgfqpoint{1.172cm}{1.902cm}}{\pgfqpoint{1.138cm}{1.887cm}}{\pgfqpoint{1.112cm}{1.862cm}}
\pgfpathcurveto{\pgfqpoint{1.087cm}{1.836cm}}{\pgfqpoint{1.072cm}{1.801cm}}{\pgfqpoint{1.072cm}{1.765cm}}
\pgfpathcurveto{\pgfqpoint{1.072cm}{1.728cm}}{\pgfqpoint{1.087cm}{1.694cm}}{\pgfqpoint{1.112cm}{1.668cm}}
\pgfpathcurveto{\pgfqpoint{1.138cm}{1.642cm}}{\pgfqpoint{1.172cm}{1.628cm}}{\pgfqpoint{1.209cm}{1.628cm}}
\pgfpathcurveto{\pgfqpoint{1.245cm}{1.628cm}}{\pgfqpoint{1.28cm}{1.642cm}}{\pgfqpoint{1.305cm}{1.668cm}}
\pgfpathcurveto{\pgfqpoint{1.331cm}{1.694cm}}{\pgfqpoint{1.345cm}{1.728cm}}{\pgfqpoint{1.345cm}{1.765cm}}
\pgfusepath{fill}
\begin{pgfscope}
\pgfsetdash{}{0cm}
\pgfsetlinewidth{0.818mm}
\pgfsetroundcap
\pgfsetroundjoin
\pgfsetmiterlimit{7.0}
\pgfpathmoveto{\pgfqpoint{0.682cm}{1.065cm}}
\pgfpathlineto{\pgfqpoint{1.246cm}{0.315cm}}
\pgfpathlineto{\pgfqpoint{1.811cm}{1.065cm}}
\pgfusepath{stroke}
\end{pgfscope}
\pgfpathmoveto{\pgfqpoint{1.948cm}{1.065cm}}
\pgfpathcurveto{\pgfqpoint{1.948cm}{1.101cm}}{\pgfqpoint{1.933cm}{1.136cm}}{\pgfqpoint{1.907cm}{1.162cm}}
\pgfpathcurveto{\pgfqpoint{1.882cm}{1.187cm}}{\pgfqpoint{1.847cm}{1.202cm}}{\pgfqpoint{1.811cm}{1.202cm}}
\pgfpathcurveto{\pgfqpoint{1.775cm}{1.202cm}}{\pgfqpoint{1.74cm}{1.187cm}}{\pgfqpoint{1.714cm}{1.162cm}}
\pgfpathcurveto{\pgfqpoint{1.689cm}{1.136cm}}{\pgfqpoint{1.674cm}{1.101cm}}{\pgfqpoint{1.674cm}{1.065cm}}
\pgfpathcurveto{\pgfqpoint{1.674cm}{1.029cm}}{\pgfqpoint{1.689cm}{0.994cm}}{\pgfqpoint{1.714cm}{0.968cm}}
\pgfpathcurveto{\pgfqpoint{1.74cm}{0.942cm}}{\pgfqpoint{1.775cm}{0.928cm}}{\pgfqpoint{1.811cm}{0.928cm}}
\pgfpathcurveto{\pgfqpoint{1.847cm}{0.928cm}}{\pgfqpoint{1.882cm}{0.942cm}}{\pgfqpoint{1.907cm}{0.968cm}}
\pgfpathcurveto{\pgfqpoint{1.933cm}{0.994cm}}{\pgfqpoint{1.948cm}{1.029cm}}{\pgfqpoint{1.948cm}{1.065cm}}
\pgfusepath{fill}
\begin{pgfscope}
\pgfsetdash{}{0cm}
\pgfsetlinewidth{0.818mm}
\pgfsetmiterlimit{4.0}
\pgfpathmoveto{\pgfqpoint{1.383cm}{0.178cm}}
\pgfpathcurveto{\pgfqpoint{1.383cm}{0.214cm}}{\pgfqpoint{1.369cm}{0.249cm}}{\pgfqpoint{1.343cm}{0.275cm}}
\pgfpathcurveto{\pgfqpoint{1.317cm}{0.3cm}}{\pgfqpoint{1.283cm}{0.315cm}}{\pgfqpoint{1.246cm}{0.315cm}}
\pgfpathcurveto{\pgfqpoint{1.21cm}{0.315cm}}{\pgfqpoint{1.175cm}{0.3cm}}{\pgfqpoint{1.15cm}{0.275cm}}
\pgfpathcurveto{\pgfqpoint{1.124cm}{0.249cm}}{\pgfqpoint{1.11cm}{0.214cm}}{\pgfqpoint{1.11cm}{0.178cm}}
\pgfpathcurveto{\pgfqpoint{1.11cm}{0.141cm}}{\pgfqpoint{1.124cm}{0.107cm}}{\pgfqpoint{1.15cm}{0.081cm}}
\pgfpathcurveto{\pgfqpoint{1.175cm}{0.055cm}}{\pgfqpoint{1.21cm}{0.041cm}}{\pgfqpoint{1.246cm}{0.041cm}}
\pgfpathcurveto{\pgfqpoint{1.283cm}{0.041cm}}{\pgfqpoint{1.317cm}{0.055cm}}{\pgfqpoint{1.343cm}{0.081cm}}
\pgfpathcurveto{\pgfqpoint{1.369cm}{0.107cm}}{\pgfqpoint{1.383cm}{0.141cm}}{\pgfqpoint{1.383cm}{0.178cm}}
\pgfusepath{stroke}
\end{pgfscope}
\end{pgfscope}
\end{pgfscope}
\end{pgfscope}
\end{tikzpicture}}}\|_{\CC^\gamma(\rho^{3+\gamma})}\lesssim\|\phi+\psi\|_{\CC^{\gamma+\kappa}(\rho^{2+\gamma+\kappa})}\lesssim 1+\|\psi\|_{\CC^{1+\alpha}(\rho^{2+\alpha})},
\end{align*}
\begin{align*}
\begin{aligned}
\|3\UU_\leqslant X\succ (\phi+\psi)^2\|_{\CC^\gamma(\rho^{3+\gamma})}
&\lesssim\|3\UU_\leqslant X\|_{\CC^\gamma(\rho^{1+\gamma})}\|\phi+\psi\|^2_{L^\infty(\rho)}\lesssim 2^{(\frac{1}{2}+\gamma+\kappa)\frac{4}{3}K}\|\phi+\psi\|^2_{L^\infty(\rho)}\\
&\lesssim 1+\|\psi\|_{L^\infty(\rho)}^{2+\varepsilon}
\end{aligned}
\end{align*}
and finally
\begin{align*}
\|3 X\preccurlyeq(\phi+\psi)^2\|_{\CC^\gamma(\rho^{3+\gamma})}
&\lesssim \|(\phi+\psi)^2\|_{\CC^{\frac{1}{2}+\gamma+\kappa}(\rho^{3})}\lesssim \|\phi+\psi\|_{L^\infty(\rho)}\|\phi+\psi\|_{\CC^{\frac{1}{2}+\gamma+\kappa}(\rho^{2})}\\
&\lesssim (1+\|\psi\|_{L^\infty(\rho)})(1+\|\psi\|_{L^\infty(\rho)}^\varepsilon+\|\psi\|_{\CC^{\frac{1}{2}+\alpha}(\rho^{\frac{3}{2}+\alpha})})\\
&\lesssim 1 +\|\psi\|_{L^\infty(\rho)}^{1+\varepsilon}+\|\psi\|_{\CC^{\frac{1}{2}+\alpha}(\rho^{\frac{3}{2}+\alpha})}+\|\psi\|_{L^\infty(\rho)}\|\psi\|_{\CC^{\frac{1}{2}+\alpha}(\rho^{\frac{3}{2}+\alpha})}.
\end{align*}

To summarize, we have shown that
\begin{align*}
\|\Psi\|_{\CC^{\gamma}(\rho^{3+\gamma})}
&\lesssim 1+ \|\psi\|_{\CC^{1+\alpha}(\rho^{2+\alpha})} +\|\psi\|_{\CC^{\frac{1}{2}+\alpha}(\rho^{\frac{3}{2}+\alpha})}
\\
&\quad +\|\psi\|^{2+\varepsilon}_{L^\infty(\rho)} 
+\|\psi\|^{1+\varepsilon}_{L^\infty(\rho)}\|\psi\|_{\CC^{\gamma}(\rho^{1+\gamma})}\\
&\quad +\|\psi\|_{L^\infty(\rho)}\|\psi\|_{\CC^{\frac{1}{2}+\alpha}(\rho^{\frac{3}{2}+\alpha})}.
\end{align*}
Due to interpolation from Lemma \ref{lemma:interp} we estimate
\begin{align*}
\|\psi\|_{\CC^{1+\alpha}(\rho^{2+\alpha})}\lesssim \|\psi\|^{1-\frac{1+\alpha}{2+\gamma}}_{L^\infty(\rho)} \|\psi\|^\frac{1+\alpha}{2+\gamma}_{\CC^{2+\gamma}(\rho^{3+\gamma})},
\end{align*}
\begin{align*}
\|\psi\|_{\CC^{\gamma}(\rho^{1+\gamma})}\lesssim \|\psi\|^{1-\frac{\gamma}{2+\gamma}}_{L^\infty(\rho)} \|\psi\|^\frac{\gamma}{2+\gamma}_{\CC^{2+\gamma}(\rho^{3+\gamma})},
\end{align*}
\begin{align*}
\|\psi\|_{\CC^{\frac{1}{2}+\alpha}(\rho^{\frac{3}{2}+\alpha})}\lesssim \|\psi\|^{1-\frac{\frac{1}{2}+\alpha}{2+\gamma}}_{L^\infty(\rho)} \|\psi\|^{\frac{\frac{1}{2}+\alpha}{2+\gamma}}_{\CC^{2+\gamma}(\rho^{3+\gamma})},
\end{align*}
therefore,  Lemma \ref{lemma:schauder-ellptic} together with the weighted Young inequality implies
\begin{align}\label{eq:97}
\|\psi\|_{\CC^{2+\gamma}(\rho^{3+\gamma})}\lesssim \|\Psi\|_{\CC^{\gamma}(\rho^{3+\gamma})}+\|\psi\|^{3+\gamma}_{L^\infty(\rho)} \lesssim 1 +\|\psi\|^{3+\gamma}_{L^\infty(\rho)} .
\end{align}

\subsection{Bound for $\psi$ in $L^\infty(\rho)$}
\label{ssec:psi2}

As the next step, towards the application of Lemma \ref{lemma:apriori-elliptic}, it is necessary to estimate $\Psi$ in $L^\infty(\rho^3)$. We observe that for most of the terms we may use the estimates above, only the cubic term is estimated as follows
\begin{align*}
&\|(-X^{\!\resizebox{0.6em}{!}{
\begin{tikzpicture}
\pgfpathmoveto{\pgfqpoint{0cm}{-0.035cm}}
\pgfpathlineto{\pgfqpoint{1.376cm}{-0.035cm}}
\pgfpathlineto{\pgfqpoint{1.376cm}{1.552cm}}
\pgfpathlineto{\pgfqpoint{0cm}{1.552cm}}
\pgfpathclose
\pgfusepath{clip}
\begin{pgfscope}
\begin{pgfscope}
\pgfpathmoveto{\pgfqpoint{0cm}{-0.035cm}}
\pgfpathlineto{\pgfqpoint{1.376cm}{-0.035cm}}
\pgfpathlineto{\pgfqpoint{1.376cm}{1.552cm}}
\pgfpathlineto{\pgfqpoint{0cm}{1.552cm}}
\pgfpathclose
\pgfusepath{clip}
\begin{pgfscope}
\begin{pgfscope}
\pgfsetdash{}{0cm}
\pgfsetlinewidth{0.818mm}
\pgfsetroundcap
\pgfsetroundjoin
\pgfsetmiterlimit{7.0}
\definecolor{eps2pgf_color}{gray}{0}\pgfsetstrokecolor{eps2pgf_color}\pgfsetfillcolor{eps2pgf_color}
\pgfpathmoveto{\pgfqpoint{0.117cm}{1.421cm}}
\pgfpathlineto{\pgfqpoint{0.682cm}{0.671cm}}
\pgfpathlineto{\pgfqpoint{1.246cm}{1.421cm}}
\pgfusepath{stroke}
\end{pgfscope}
\definecolor{eps2pgf_color}{gray}{0}\pgfsetstrokecolor{eps2pgf_color}\pgfsetfillcolor{eps2pgf_color}
\pgfpathmoveto{\pgfqpoint{0.273cm}{1.395cm}}
\pgfpathcurveto{\pgfqpoint{0.273cm}{1.432cm}}{\pgfqpoint{0.259cm}{1.467cm}}{\pgfqpoint{0.233cm}{1.492cm}}
\pgfpathcurveto{\pgfqpoint{0.207cm}{1.518cm}}{\pgfqpoint{0.173cm}{1.532cm}}{\pgfqpoint{0.137cm}{1.532cm}}
\pgfpathcurveto{\pgfqpoint{0.1cm}{1.532cm}}{\pgfqpoint{0.066cm}{1.518cm}}{\pgfqpoint{0.04cm}{1.492cm}}
\pgfpathcurveto{\pgfqpoint{0.014cm}{1.467cm}}{\pgfqpoint{0cm}{1.432cm}}{\pgfqpoint{0cm}{1.395cm}}
\pgfpathcurveto{\pgfqpoint{0cm}{1.359cm}}{\pgfqpoint{0.014cm}{1.324cm}}{\pgfqpoint{0.04cm}{1.299cm}}
\pgfpathcurveto{\pgfqpoint{0.066cm}{1.273cm}}{\pgfqpoint{0.1cm}{1.258cm}}{\pgfqpoint{0.137cm}{1.258cm}}
\pgfpathcurveto{\pgfqpoint{0.173cm}{1.258cm}}{\pgfqpoint{0.207cm}{1.273cm}}{\pgfqpoint{0.233cm}{1.299cm}}
\pgfpathcurveto{\pgfqpoint{0.259cm}{1.324cm}}{\pgfqpoint{0.273cm}{1.359cm}}{\pgfqpoint{0.273cm}{1.395cm}}
\pgfusepath{fill}
\begin{pgfscope}
\pgfsetdash{}{0cm}
\pgfsetlinewidth{0.818mm}
\pgfsetmiterlimit{7.0}
\pgfpathmoveto{\pgfqpoint{0.682cm}{0.671cm}}
\pgfpathlineto{\pgfqpoint{0.679cm}{1.418cm}}
\pgfusepath{stroke}
\end{pgfscope}
\pgfpathmoveto{\pgfqpoint{0.815cm}{1.399cm}}
\pgfpathcurveto{\pgfqpoint{0.815cm}{1.435cm}}{\pgfqpoint{0.801cm}{1.47cm}}{\pgfqpoint{0.775cm}{1.496cm}}
\pgfpathcurveto{\pgfqpoint{0.75cm}{1.521cm}}{\pgfqpoint{0.715cm}{1.536cm}}{\pgfqpoint{0.679cm}{1.536cm}}
\pgfpathcurveto{\pgfqpoint{0.643cm}{1.536cm}}{\pgfqpoint{0.608cm}{1.521cm}}{\pgfqpoint{0.582cm}{1.496cm}}
\pgfpathcurveto{\pgfqpoint{0.557cm}{1.47cm}}{\pgfqpoint{0.542cm}{1.435cm}}{\pgfqpoint{0.542cm}{1.399cm}}
\pgfpathcurveto{\pgfqpoint{0.542cm}{1.363cm}}{\pgfqpoint{0.557cm}{1.328cm}}{\pgfqpoint{0.582cm}{1.302cm}}
\pgfpathcurveto{\pgfqpoint{0.608cm}{1.276cm}}{\pgfqpoint{0.643cm}{1.262cm}}{\pgfqpoint{0.679cm}{1.262cm}}
\pgfpathcurveto{\pgfqpoint{0.715cm}{1.262cm}}{\pgfqpoint{0.75cm}{1.276cm}}{\pgfqpoint{0.775cm}{1.302cm}}
\pgfpathcurveto{\pgfqpoint{0.801cm}{1.328cm}}{\pgfqpoint{0.815cm}{1.363cm}}{\pgfqpoint{0.815cm}{1.399cm}}
\pgfusepath{fill}
\pgfpathmoveto{\pgfqpoint{1.345cm}{1.371cm}}
\pgfpathcurveto{\pgfqpoint{1.345cm}{1.408cm}}{\pgfqpoint{1.331cm}{1.442cm}}{\pgfqpoint{1.305cm}{1.468cm}}
\pgfpathcurveto{\pgfqpoint{1.28cm}{1.494cm}}{\pgfqpoint{1.245cm}{1.508cm}}{\pgfqpoint{1.209cm}{1.508cm}}
\pgfpathcurveto{\pgfqpoint{1.172cm}{1.508cm}}{\pgfqpoint{1.138cm}{1.494cm}}{\pgfqpoint{1.112cm}{1.468cm}}
\pgfpathcurveto{\pgfqpoint{1.087cm}{1.442cm}}{\pgfqpoint{1.072cm}{1.408cm}}{\pgfqpoint{1.072cm}{1.371cm}}
\pgfpathcurveto{\pgfqpoint{1.072cm}{1.335cm}}{\pgfqpoint{1.087cm}{1.3cm}}{\pgfqpoint{1.112cm}{1.274cm}}
\pgfpathcurveto{\pgfqpoint{1.138cm}{1.249cm}}{\pgfqpoint{1.172cm}{1.234cm}}{\pgfqpoint{1.209cm}{1.234cm}}
\pgfpathcurveto{\pgfqpoint{1.245cm}{1.234cm}}{\pgfqpoint{1.28cm}{1.249cm}}{\pgfqpoint{1.305cm}{1.274cm}}
\pgfpathcurveto{\pgfqpoint{1.331cm}{1.3cm}}{\pgfqpoint{1.345cm}{1.335cm}}{\pgfqpoint{1.345cm}{1.371cm}}
\pgfusepath{fill}
\begin{pgfscope}
\pgfsetdash{}{0cm}
\pgfsetlinewidth{0.818mm}
\pgfsetroundcap
\pgfsetmiterlimit{4.0}
\pgfpathmoveto{\pgfqpoint{0.682cm}{0.671cm}}
\pgfpathlineto{\pgfqpoint{0.682cm}{0.042cm}}
\pgfusepath{stroke}
\end{pgfscope}
\end{pgfscope}
\end{pgfscope}
\end{pgfscope}
\end{tikzpicture}}}+\phi)^3+3(-X^{\!\resizebox{0.6em}{!}{
\begin{tikzpicture}
\pgfpathmoveto{\pgfqpoint{0cm}{-0.035cm}}
\pgfpathlineto{\pgfqpoint{1.376cm}{-0.035cm}}
\pgfpathlineto{\pgfqpoint{1.376cm}{1.552cm}}
\pgfpathlineto{\pgfqpoint{0cm}{1.552cm}}
\pgfpathclose
\pgfusepath{clip}
\begin{pgfscope}
\begin{pgfscope}
\pgfpathmoveto{\pgfqpoint{0cm}{-0.035cm}}
\pgfpathlineto{\pgfqpoint{1.376cm}{-0.035cm}}
\pgfpathlineto{\pgfqpoint{1.376cm}{1.552cm}}
\pgfpathlineto{\pgfqpoint{0cm}{1.552cm}}
\pgfpathclose
\pgfusepath{clip}
\begin{pgfscope}
\begin{pgfscope}
\pgfsetdash{}{0cm}
\pgfsetlinewidth{0.818mm}
\pgfsetroundcap
\pgfsetroundjoin
\pgfsetmiterlimit{7.0}
\definecolor{eps2pgf_color}{gray}{0}\pgfsetstrokecolor{eps2pgf_color}\pgfsetfillcolor{eps2pgf_color}
\pgfpathmoveto{\pgfqpoint{0.117cm}{1.421cm}}
\pgfpathlineto{\pgfqpoint{0.682cm}{0.671cm}}
\pgfpathlineto{\pgfqpoint{1.246cm}{1.421cm}}
\pgfusepath{stroke}
\end{pgfscope}
\definecolor{eps2pgf_color}{gray}{0}\pgfsetstrokecolor{eps2pgf_color}\pgfsetfillcolor{eps2pgf_color}
\pgfpathmoveto{\pgfqpoint{0.273cm}{1.395cm}}
\pgfpathcurveto{\pgfqpoint{0.273cm}{1.432cm}}{\pgfqpoint{0.259cm}{1.467cm}}{\pgfqpoint{0.233cm}{1.492cm}}
\pgfpathcurveto{\pgfqpoint{0.207cm}{1.518cm}}{\pgfqpoint{0.173cm}{1.532cm}}{\pgfqpoint{0.137cm}{1.532cm}}
\pgfpathcurveto{\pgfqpoint{0.1cm}{1.532cm}}{\pgfqpoint{0.066cm}{1.518cm}}{\pgfqpoint{0.04cm}{1.492cm}}
\pgfpathcurveto{\pgfqpoint{0.014cm}{1.467cm}}{\pgfqpoint{0cm}{1.432cm}}{\pgfqpoint{0cm}{1.395cm}}
\pgfpathcurveto{\pgfqpoint{0cm}{1.359cm}}{\pgfqpoint{0.014cm}{1.324cm}}{\pgfqpoint{0.04cm}{1.299cm}}
\pgfpathcurveto{\pgfqpoint{0.066cm}{1.273cm}}{\pgfqpoint{0.1cm}{1.258cm}}{\pgfqpoint{0.137cm}{1.258cm}}
\pgfpathcurveto{\pgfqpoint{0.173cm}{1.258cm}}{\pgfqpoint{0.207cm}{1.273cm}}{\pgfqpoint{0.233cm}{1.299cm}}
\pgfpathcurveto{\pgfqpoint{0.259cm}{1.324cm}}{\pgfqpoint{0.273cm}{1.359cm}}{\pgfqpoint{0.273cm}{1.395cm}}
\pgfusepath{fill}
\begin{pgfscope}
\pgfsetdash{}{0cm}
\pgfsetlinewidth{0.818mm}
\pgfsetmiterlimit{7.0}
\pgfpathmoveto{\pgfqpoint{0.682cm}{0.671cm}}
\pgfpathlineto{\pgfqpoint{0.679cm}{1.418cm}}
\pgfusepath{stroke}
\end{pgfscope}
\pgfpathmoveto{\pgfqpoint{0.815cm}{1.399cm}}
\pgfpathcurveto{\pgfqpoint{0.815cm}{1.435cm}}{\pgfqpoint{0.801cm}{1.47cm}}{\pgfqpoint{0.775cm}{1.496cm}}
\pgfpathcurveto{\pgfqpoint{0.75cm}{1.521cm}}{\pgfqpoint{0.715cm}{1.536cm}}{\pgfqpoint{0.679cm}{1.536cm}}
\pgfpathcurveto{\pgfqpoint{0.643cm}{1.536cm}}{\pgfqpoint{0.608cm}{1.521cm}}{\pgfqpoint{0.582cm}{1.496cm}}
\pgfpathcurveto{\pgfqpoint{0.557cm}{1.47cm}}{\pgfqpoint{0.542cm}{1.435cm}}{\pgfqpoint{0.542cm}{1.399cm}}
\pgfpathcurveto{\pgfqpoint{0.542cm}{1.363cm}}{\pgfqpoint{0.557cm}{1.328cm}}{\pgfqpoint{0.582cm}{1.302cm}}
\pgfpathcurveto{\pgfqpoint{0.608cm}{1.276cm}}{\pgfqpoint{0.643cm}{1.262cm}}{\pgfqpoint{0.679cm}{1.262cm}}
\pgfpathcurveto{\pgfqpoint{0.715cm}{1.262cm}}{\pgfqpoint{0.75cm}{1.276cm}}{\pgfqpoint{0.775cm}{1.302cm}}
\pgfpathcurveto{\pgfqpoint{0.801cm}{1.328cm}}{\pgfqpoint{0.815cm}{1.363cm}}{\pgfqpoint{0.815cm}{1.399cm}}
\pgfusepath{fill}
\pgfpathmoveto{\pgfqpoint{1.345cm}{1.371cm}}
\pgfpathcurveto{\pgfqpoint{1.345cm}{1.408cm}}{\pgfqpoint{1.331cm}{1.442cm}}{\pgfqpoint{1.305cm}{1.468cm}}
\pgfpathcurveto{\pgfqpoint{1.28cm}{1.494cm}}{\pgfqpoint{1.245cm}{1.508cm}}{\pgfqpoint{1.209cm}{1.508cm}}
\pgfpathcurveto{\pgfqpoint{1.172cm}{1.508cm}}{\pgfqpoint{1.138cm}{1.494cm}}{\pgfqpoint{1.112cm}{1.468cm}}
\pgfpathcurveto{\pgfqpoint{1.087cm}{1.442cm}}{\pgfqpoint{1.072cm}{1.408cm}}{\pgfqpoint{1.072cm}{1.371cm}}
\pgfpathcurveto{\pgfqpoint{1.072cm}{1.335cm}}{\pgfqpoint{1.087cm}{1.3cm}}{\pgfqpoint{1.112cm}{1.274cm}}
\pgfpathcurveto{\pgfqpoint{1.138cm}{1.249cm}}{\pgfqpoint{1.172cm}{1.234cm}}{\pgfqpoint{1.209cm}{1.234cm}}
\pgfpathcurveto{\pgfqpoint{1.245cm}{1.234cm}}{\pgfqpoint{1.28cm}{1.249cm}}{\pgfqpoint{1.305cm}{1.274cm}}
\pgfpathcurveto{\pgfqpoint{1.331cm}{1.3cm}}{\pgfqpoint{1.345cm}{1.335cm}}{\pgfqpoint{1.345cm}{1.371cm}}
\pgfusepath{fill}
\begin{pgfscope}
\pgfsetdash{}{0cm}
\pgfsetlinewidth{0.818mm}
\pgfsetroundcap
\pgfsetmiterlimit{4.0}
\pgfpathmoveto{\pgfqpoint{0.682cm}{0.671cm}}
\pgfpathlineto{\pgfqpoint{0.682cm}{0.042cm}}
\pgfusepath{stroke}
\end{pgfscope}
\end{pgfscope}
\end{pgfscope}
\end{pgfscope}
\end{tikzpicture}}}+\phi)^2\psi+3(-X^{\!\resizebox{0.6em}{!}{
\begin{tikzpicture}
\pgfpathmoveto{\pgfqpoint{0cm}{-0.035cm}}
\pgfpathlineto{\pgfqpoint{1.376cm}{-0.035cm}}
\pgfpathlineto{\pgfqpoint{1.376cm}{1.552cm}}
\pgfpathlineto{\pgfqpoint{0cm}{1.552cm}}
\pgfpathclose
\pgfusepath{clip}
\begin{pgfscope}
\begin{pgfscope}
\pgfpathmoveto{\pgfqpoint{0cm}{-0.035cm}}
\pgfpathlineto{\pgfqpoint{1.376cm}{-0.035cm}}
\pgfpathlineto{\pgfqpoint{1.376cm}{1.552cm}}
\pgfpathlineto{\pgfqpoint{0cm}{1.552cm}}
\pgfpathclose
\pgfusepath{clip}
\begin{pgfscope}
\begin{pgfscope}
\pgfsetdash{}{0cm}
\pgfsetlinewidth{0.818mm}
\pgfsetroundcap
\pgfsetroundjoin
\pgfsetmiterlimit{7.0}
\definecolor{eps2pgf_color}{gray}{0}\pgfsetstrokecolor{eps2pgf_color}\pgfsetfillcolor{eps2pgf_color}
\pgfpathmoveto{\pgfqpoint{0.117cm}{1.421cm}}
\pgfpathlineto{\pgfqpoint{0.682cm}{0.671cm}}
\pgfpathlineto{\pgfqpoint{1.246cm}{1.421cm}}
\pgfusepath{stroke}
\end{pgfscope}
\definecolor{eps2pgf_color}{gray}{0}\pgfsetstrokecolor{eps2pgf_color}\pgfsetfillcolor{eps2pgf_color}
\pgfpathmoveto{\pgfqpoint{0.273cm}{1.395cm}}
\pgfpathcurveto{\pgfqpoint{0.273cm}{1.432cm}}{\pgfqpoint{0.259cm}{1.467cm}}{\pgfqpoint{0.233cm}{1.492cm}}
\pgfpathcurveto{\pgfqpoint{0.207cm}{1.518cm}}{\pgfqpoint{0.173cm}{1.532cm}}{\pgfqpoint{0.137cm}{1.532cm}}
\pgfpathcurveto{\pgfqpoint{0.1cm}{1.532cm}}{\pgfqpoint{0.066cm}{1.518cm}}{\pgfqpoint{0.04cm}{1.492cm}}
\pgfpathcurveto{\pgfqpoint{0.014cm}{1.467cm}}{\pgfqpoint{0cm}{1.432cm}}{\pgfqpoint{0cm}{1.395cm}}
\pgfpathcurveto{\pgfqpoint{0cm}{1.359cm}}{\pgfqpoint{0.014cm}{1.324cm}}{\pgfqpoint{0.04cm}{1.299cm}}
\pgfpathcurveto{\pgfqpoint{0.066cm}{1.273cm}}{\pgfqpoint{0.1cm}{1.258cm}}{\pgfqpoint{0.137cm}{1.258cm}}
\pgfpathcurveto{\pgfqpoint{0.173cm}{1.258cm}}{\pgfqpoint{0.207cm}{1.273cm}}{\pgfqpoint{0.233cm}{1.299cm}}
\pgfpathcurveto{\pgfqpoint{0.259cm}{1.324cm}}{\pgfqpoint{0.273cm}{1.359cm}}{\pgfqpoint{0.273cm}{1.395cm}}
\pgfusepath{fill}
\begin{pgfscope}
\pgfsetdash{}{0cm}
\pgfsetlinewidth{0.818mm}
\pgfsetmiterlimit{7.0}
\pgfpathmoveto{\pgfqpoint{0.682cm}{0.671cm}}
\pgfpathlineto{\pgfqpoint{0.679cm}{1.418cm}}
\pgfusepath{stroke}
\end{pgfscope}
\pgfpathmoveto{\pgfqpoint{0.815cm}{1.399cm}}
\pgfpathcurveto{\pgfqpoint{0.815cm}{1.435cm}}{\pgfqpoint{0.801cm}{1.47cm}}{\pgfqpoint{0.775cm}{1.496cm}}
\pgfpathcurveto{\pgfqpoint{0.75cm}{1.521cm}}{\pgfqpoint{0.715cm}{1.536cm}}{\pgfqpoint{0.679cm}{1.536cm}}
\pgfpathcurveto{\pgfqpoint{0.643cm}{1.536cm}}{\pgfqpoint{0.608cm}{1.521cm}}{\pgfqpoint{0.582cm}{1.496cm}}
\pgfpathcurveto{\pgfqpoint{0.557cm}{1.47cm}}{\pgfqpoint{0.542cm}{1.435cm}}{\pgfqpoint{0.542cm}{1.399cm}}
\pgfpathcurveto{\pgfqpoint{0.542cm}{1.363cm}}{\pgfqpoint{0.557cm}{1.328cm}}{\pgfqpoint{0.582cm}{1.302cm}}
\pgfpathcurveto{\pgfqpoint{0.608cm}{1.276cm}}{\pgfqpoint{0.643cm}{1.262cm}}{\pgfqpoint{0.679cm}{1.262cm}}
\pgfpathcurveto{\pgfqpoint{0.715cm}{1.262cm}}{\pgfqpoint{0.75cm}{1.276cm}}{\pgfqpoint{0.775cm}{1.302cm}}
\pgfpathcurveto{\pgfqpoint{0.801cm}{1.328cm}}{\pgfqpoint{0.815cm}{1.363cm}}{\pgfqpoint{0.815cm}{1.399cm}}
\pgfusepath{fill}
\pgfpathmoveto{\pgfqpoint{1.345cm}{1.371cm}}
\pgfpathcurveto{\pgfqpoint{1.345cm}{1.408cm}}{\pgfqpoint{1.331cm}{1.442cm}}{\pgfqpoint{1.305cm}{1.468cm}}
\pgfpathcurveto{\pgfqpoint{1.28cm}{1.494cm}}{\pgfqpoint{1.245cm}{1.508cm}}{\pgfqpoint{1.209cm}{1.508cm}}
\pgfpathcurveto{\pgfqpoint{1.172cm}{1.508cm}}{\pgfqpoint{1.138cm}{1.494cm}}{\pgfqpoint{1.112cm}{1.468cm}}
\pgfpathcurveto{\pgfqpoint{1.087cm}{1.442cm}}{\pgfqpoint{1.072cm}{1.408cm}}{\pgfqpoint{1.072cm}{1.371cm}}
\pgfpathcurveto{\pgfqpoint{1.072cm}{1.335cm}}{\pgfqpoint{1.087cm}{1.3cm}}{\pgfqpoint{1.112cm}{1.274cm}}
\pgfpathcurveto{\pgfqpoint{1.138cm}{1.249cm}}{\pgfqpoint{1.172cm}{1.234cm}}{\pgfqpoint{1.209cm}{1.234cm}}
\pgfpathcurveto{\pgfqpoint{1.245cm}{1.234cm}}{\pgfqpoint{1.28cm}{1.249cm}}{\pgfqpoint{1.305cm}{1.274cm}}
\pgfpathcurveto{\pgfqpoint{1.331cm}{1.3cm}}{\pgfqpoint{1.345cm}{1.335cm}}{\pgfqpoint{1.345cm}{1.371cm}}
\pgfusepath{fill}
\begin{pgfscope}
\pgfsetdash{}{0cm}
\pgfsetlinewidth{0.818mm}
\pgfsetroundcap
\pgfsetmiterlimit{4.0}
\pgfpathmoveto{\pgfqpoint{0.682cm}{0.671cm}}
\pgfpathlineto{\pgfqpoint{0.682cm}{0.042cm}}
\pgfusepath{stroke}
\end{pgfscope}
\end{pgfscope}
\end{pgfscope}
\end{pgfscope}
\end{tikzpicture}}}+\phi)\psi^2\|_{L^\infty(\rho^{3})}\lesssim 1+\|\psi\|^2_{L^\infty(\rho)},
\end{align*}
and we may also improve the bound
\begin{align*}
\begin{aligned}
\|3\UU_\leqslant\llbracket X^2 \rrbracket\prec(\phi+\psi)\|_{L^\infty(\rho^{3})}&\lesssim\|\phi+\psi\|_{L^\infty(\rho)} \|\UU_\leqslant\llbracket X^2 \rrbracket\|_{\CC^\gamma(\rho^{2})}\\
&\lesssim 2^{(1+\gamma+\kappa)K}\|\phi+\psi\|_{L^\infty(\rho)} \lesssim 1+ \|\psi\|^{2+\varepsilon}_{L^\infty(\rho)}.
\end{aligned}
\end{align*}
Therefore, we deduce
\begin{align*}
\|\Psi\|_{L^\infty(\rho^{3})}
&\lesssim 1+ \|\psi\|_{\CC^{1+\alpha}(\rho^{2+\alpha})} +\|\psi\|_{\CC^{\frac{1}{2}+\alpha}(\rho^{\frac{3}{2}+\alpha})}
\\
&\quad +\|\psi\|^{2+\varepsilon}_{L^\infty(\rho)} 
 +\|\psi\|_{L^\infty(\rho)}\|\psi\|_{\CC^{\frac{1}{2}+\alpha}(\rho^{\frac{3}{2}+\alpha})}
\end{align*}
and applying again the interpolation from Lemma \ref{lemma:interp} together with \eqref{eq:97} leads to
\begin{align*}
\|\Psi\|_{L^\infty(\rho^{3})}
&\lesssim 1+ \|\psi\|_{L^\infty(\rho)}^{2+\varepsilon} .
\end{align*}
Finally, according to Lemma \ref{lemma:apriori-elliptic} and weighted Young inequality we conclude
$$
\|\psi\|_{L^\infty(\rho)}\lesssim 1+ \|\Psi\|_{L^\infty(\rho)}^{1/3}\lesssim 1 + \|\psi\|_{L^\infty(\rho)}^\varepsilon\lesssim 1,
$$
and the proof is complete.

\subsection{Existence}
\label{ssec:ex45}

The construction of a solution proceeds similarly to Section~\ref{ssec:ex44}. More precisely,  we first consider the problem on a large torus of size $M$ and establish existence based on Schaefer's fixed point theorem \cite[Section 9.2.2, Theorem 4]{E98}. Then we make use of the a priori estimates from Sections~\ref{ssec:phi1}, \ref{ssec:phi2}, \ref{ssec:theta}, \ref{ssec:psi2}, \ref{ssec:psi2} together with Theorem \ref{thm:renorm2} and a compactness argument to pass to the limit as $M\to\infty$.

Recall that in view of the computations in Sections \ref{ssec:dec45}, \ref{s:phi}, system \eqref{eq:phi4} in dimension~5 reduces to equations \eqref{eq:two}, \eqref{eq:th}.

\begin{theorem}\label{thm:ex43}
Let $\kappa, \alpha\in (0,1)$ be chosen sufficiently small and let $\gamma=\alpha-\kappa>0$. There exists
$$(\phi,\psi)\in  [ \CC^{\frac 12+\alpha}(\rho^{\frac 32+\alpha})\cap \CC^{\alpha}(\rho)]\times[\CC^{2+\gamma}(\rho^{3+\gamma})\cap L^{\infty}(\rho)]$$
which is a solution to \eqref{eq:two}, \eqref{eq:th} on $\R^{5}$.
\end{theorem}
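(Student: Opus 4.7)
The strategy mirrors Section~\ref{ssec:ex44}: first solve the system on a large torus $\mathbb{T}^5_M$ of size $M\in\N$ by means of Schaefer's fixed point theorem, then pass to the limit $M\to\infty$ by compactness using the a priori estimates of Sections~\ref{ssec:phi1}--\ref{ssec:psi2} together with the convergence of the stochastic objects provided by Theorem~\ref{thm:renorm2}.

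For the torus step, the probabilistic objects in \eqref{eq:two}, \eqref{eq:th} are replaced by their periodic counterparts $X_M, \llbracket X_M^2\rrbracket, \ldots, \tthreethreer{X}_M$. Because of the paracontrolled ansatz~\eqref{eq:th}, $\phi$ is not a free variable but is determined from $\vartheta$ and $\psi$ via $\phi=\vartheta-3(-\tthreeone{X}_M+\phi+\psi)\prec\ttwoone{X}_M$; since $\ttwoone{X}_M$ has positive regularity $1-\kappa$, this relation can be uniquely inverted by a small contraction argument giving a bounded linear map $(\vartheta,\psi)\mapsto\phi$ on suitable Besov spaces on $\mathbb{T}^5_M$. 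I would therefore formulate the fixed point problem in the variables $(\vartheta,\psi)$: define $\mathcal{K}\colon \CC^{\beta}(\mathbb{T}^5_M)\times\CC^{\beta}(\mathbb{T}^5_M)\to \CC^{\beta}(\mathbb{T}^5_M)\times\CC^{\beta}(\mathbb{T}^5_M)$ by $\mathcal{K}(\tilde\vartheta,\tilde\psi)=(\vartheta,\psi)$ where $\tilde\phi$ is reconstructed from $(\tilde\vartheta,\tilde\psi)$ and then $(\vartheta,\psi)$ is the solution of the decoupled pair
\[ \Q\vartheta + \Theta(\tilde\phi,\tilde\psi)=0,\qquad \Q\psi+\psi^3 + \Psi(\tilde\phi,\tilde\psi)=0, \]
with $\Theta,\Psi$ as in Sections~\ref{s:phi}, \ref{ssec:theta}. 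The first equation has a unique solution by elliptic regularity, and the second by Proposition~\ref{prop:aux-1} since its nonlinearity has the right sign and its source term is locally bounded.

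To apply Schaefer's theorem one must check that $\mathcal{K}$ is continuous and compact and that the set $\{(\vartheta,\psi)=\lambda\mathcal{K}(\vartheta,\psi):\lambda\in[0,1]\}$ is bounded. Continuity and compactness follow from the Schauder estimates and the compact embeddings of Besov spaces on the torus, exactly as in Proposition~\ref{prop:fixp}, once one checks that $\Theta$ and $\Psi$ depend continuously on $(\tilde\phi,\tilde\psi)$, which is a standard multilinear/paraproduct estimate. The boundedness of the $\lambda$-family is the main obstacle: one needs to rerun the full chain of a priori estimates of Sections~\ref{ssec:phi1}--\ref{ssec:psi2} for the system with the cubic replaced by $\lambda^{-2}\psi^3$ and the remaining right-hand sides by $\lambda\,\Theta$, $\lambda\,\Psi$. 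As in the proof of Proposition~\ref{prop:fixp}, the factor $\lambda$ in front of $\Theta,\Psi$ is harmless (it can be absorbed in the implicit constants), and the factor $\lambda^{-2}$ in front of $\psi^3$ is controlled by combining the $L^\infty$-bound from Lemma~\ref{lemma:apriori-elliptic} (which gives $\|\psi\|_{L^\infty}\lesssim\lambda$ in the torus setting where the weight is trivial) with the Schauder bound from Lemma~\ref{lemma:schauder-ellptic}. This yields a solution $(\phi_M,\psi_M)\in\CC^{\frac12+\alpha}(\mathbb{T}^5_M)\times\CC^{2+\gamma}(\mathbb{T}^5_M)$.

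For the passage $M\to\infty$, I would view $(\phi_M,\psi_M)$ as periodic functions on $\R^5$ and apply the weighted a priori estimates of Sections~\ref{ssec:phi1}--\ref{ssec:psi2} with a polynomial weight $\rho(x)=\langle x\rangle^{-\nu}$. Thanks to Theorem~\ref{thm:renorm2}, the norms of the periodic stochastic data are bounded uniformly in $M$ in the relevant weighted Besov spaces, hence the same a priori analysis gives
\[ \|\phi_M\|_{\CC^{\alpha}(\rho)}+\|\phi_M\|_{\CC^{\frac12+\alpha}(\rho^{\frac32+\alpha})}+\|\psi_M\|_{L^\infty(\rho)}+\|\psi_M\|_{\CC^{2+\gamma}(\rho^{3+\gamma})}\lesssim 1 \]
uniformly in $M$. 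By the compact embedding~\eqref{eq:emb}, extracting a subsequence yields $(\phi_M,\psi_M)\to(\phi,\psi)$ in $\CC^{\alpha'}(\rho^{1+\alpha'})\times\CC^{2+\gamma'}(\rho^{3+\gamma''})$ for some $\alpha'<\alpha$, $\gamma'<\gamma<\gamma''$, with the limit enjoying the required regularity. The only nontrivial verification is that one may pass to the limit in each multilinear paraproduct/resonant term appearing in $\Phi,\Psi,\Theta$ and in the paracontrolled ansatz~\eqref{eq:th}: this follows from the a.s.\ convergence $\tau_M\to\tau$ in Theorem~\ref{thm:renorm2} combined with the continuity of the paraproducts and of the commutator $\mathrm{com}$ on the relevant weighted Besov spaces (Lemmas~\ref{lem:para}, \ref{lem:com}). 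This yields the desired $(\phi,\psi)$ solving \eqref{eq:two}, \eqref{eq:th} on $\R^5$, and as in Section~\ref{ssec:ex44} a measurable selection argument via Filippov's theorem provides $\omega$-measurability.
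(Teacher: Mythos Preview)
Your overall two-step strategy (Schaefer on $\mathbb{T}^5_M$, then compactness as $M\to\infty$) matches the paper, and Step~2 is essentially identical. The implementation of Step~1, however, differs from the paper in a way that creates two concrete gaps.

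\textbf{Choice of fixed-point variables and spaces.} The paper runs Schaefer in the variables $(\tilde\phi,\tilde\psi)\in\CC^{\frac12+\beta}(\mathbb{T}^5_M)\times\CC^{1+\beta}(\mathbb{T}^5_M)$, solves $\Q\phi+\Phi(\tilde\phi,\tilde\psi)=0$ directly, and then \emph{defines} $\vartheta:=\phi+3(-\tthreeone{X}_M+\tilde\phi+\tilde\psi)\prec\ttwoone{X}_M$; the required $\CC^{1+\alpha}$ regularity of $\vartheta$ then follows from the \emph{equation} $\Q\vartheta+\Theta=0$ via Section~\ref{ssec:theta}. No inversion of the paracontrolled ansatz is needed. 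Your route, formulating the map in $(\vartheta,\psi)$, forces you to invert $\phi\mapsto\phi+3\phi\prec\ttwoone{X}_M$. You call this a ``small contraction argument'', but the operator norm of $\phi\mapsto3\phi\prec\ttwoone{X}_M$ on $\CC^\beta$ is of order $\|\ttwoone{X}_M\|_{\CC^\beta}$, which is a random $O(1)$ quantity, not small. Invertibility would require a separate argument (e.g.\ a high/low frequency splitting of $\ttwoone{X}_M$ or a Fredholm argument with injectivity), which you do not supply.

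\textbf{Regularity of the input $\tilde\psi$.} Your choice of domain $\CC^\beta\times\CC^\beta$ is too weak: the blue term $3\llbracket X^2\rrbracket\circ\tilde\psi$ in $\Psi$ requires $\tilde\psi$ of regularity strictly above $1+\kappa$ (since $\llbracket X^2\rrbracket\in\CC^{-1-\kappa}$ in $d=5$). This is why the paper takes $\tilde\psi\in\CC^{1+\beta}$. Likewise $\llbracket X^2\rrbracket\circ\tilde\vartheta$ needs $\tilde\vartheta$ of regularity above $1+\kappa$, which in the paper comes for free from the $\vartheta$-equation once $\phi$ is solved.

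In short, the paper's route is strictly simpler: take $(\phi,\psi)$ as primary, derive $\vartheta$ afterwards, and place the fixed point in $\CC^{\frac12+\beta}\times\CC^{1+\beta}$. Your $(\vartheta,\psi)$ formulation could in principle be made to work, but only after fixing the function spaces and giving a genuine invertibility argument for $I+3(\cdot)\prec\ttwoone{X}_M$.
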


\begin{proof} \emph{Step 1 -- existence on a large torus:}
Similarly to the proof of Theorem \ref{thm:ex-44-per}, we define a fixed point map
$$\mathcal{K}:\CC^{\frac 12+\beta}(\mathbb{T}_{M}^{5})\times \CC^{1+\beta}(\mathbb{T}_{M}^{5})\to\CC^{\frac 12+\beta}(\mathbb{T}_{M}^{5})\times \CC^{1+\beta}(\mathbb{T}_{M}^{5})$$ for a small parameter $\beta\in(0,1)$ as follows: given
$$(\tilde\phi,\tilde\psi)\in \CC^{\frac 12+\beta}(\mathbb{T}_{M}^{5})\times \CC^{ 1+\beta}(\mathbb{T}_{M}^{5}),$$
let $\mathcal{K}(\tilde\phi,\tilde\psi)=(\phi,\psi)$ be a solution to 
\begin{equation}\label{eq:45d}
\Q \phi + \Phi(\tilde\phi,\tilde\psi) = 0, \qquad \Q \psi + \psi^3 + \Psi(\tilde\phi,\tilde\psi) = 0,
\end{equation}
where $\Phi(\tilde\phi,\tilde\psi)$ and $\Psi(\tilde\phi,\tilde\psi)$ contain all the magenta and blue terms from Section \ref{s:phi}, respectively, with $\phi,\psi$ replaced by $\tilde\phi,\tilde
\psi$.

The first equation in \eqref{eq:45d} always has a (unique) solution $\phi$ which belongs to $\CC^{\alpha}(\mathbb{T}_{M}^{5})$  due to the bounds in Section \ref{ssec:phi1}. Moreover, Section \ref{ssec:phi2} shows that $\phi\in\CC^{\frac 12+\alpha}(\mathbb{T}_{M}^{5})$ and we may choose $\alpha>\beta$. Furthermore, similarly to \eqref{eq:th} we denote
$$
\vartheta:=\phi +3(-X^{\!\resizebox{0.6em}{!}{
\begin{tikzpicture}
\pgfpathmoveto{\pgfqpoint{0cm}{-0.035cm}}
\pgfpathlineto{\pgfqpoint{1.376cm}{-0.035cm}}
\pgfpathlineto{\pgfqpoint{1.376cm}{1.552cm}}
\pgfpathlineto{\pgfqpoint{0cm}{1.552cm}}
\pgfpathclose
\pgfusepath{clip}
\begin{pgfscope}
\begin{pgfscope}
\pgfpathmoveto{\pgfqpoint{0cm}{-0.035cm}}
\pgfpathlineto{\pgfqpoint{1.376cm}{-0.035cm}}
\pgfpathlineto{\pgfqpoint{1.376cm}{1.552cm}}
\pgfpathlineto{\pgfqpoint{0cm}{1.552cm}}
\pgfpathclose
\pgfusepath{clip}
\begin{pgfscope}
\begin{pgfscope}
\pgfsetdash{}{0cm}
\pgfsetlinewidth{0.818mm}
\pgfsetroundcap
\pgfsetroundjoin
\pgfsetmiterlimit{7.0}
\definecolor{eps2pgf_color}{gray}{0}\pgfsetstrokecolor{eps2pgf_color}\pgfsetfillcolor{eps2pgf_color}
\pgfpathmoveto{\pgfqpoint{0.117cm}{1.421cm}}
\pgfpathlineto{\pgfqpoint{0.682cm}{0.671cm}}
\pgfpathlineto{\pgfqpoint{1.246cm}{1.421cm}}
\pgfusepath{stroke}
\end{pgfscope}
\definecolor{eps2pgf_color}{gray}{0}\pgfsetstrokecolor{eps2pgf_color}\pgfsetfillcolor{eps2pgf_color}
\pgfpathmoveto{\pgfqpoint{0.273cm}{1.395cm}}
\pgfpathcurveto{\pgfqpoint{0.273cm}{1.432cm}}{\pgfqpoint{0.259cm}{1.467cm}}{\pgfqpoint{0.233cm}{1.492cm}}
\pgfpathcurveto{\pgfqpoint{0.207cm}{1.518cm}}{\pgfqpoint{0.173cm}{1.532cm}}{\pgfqpoint{0.137cm}{1.532cm}}
\pgfpathcurveto{\pgfqpoint{0.1cm}{1.532cm}}{\pgfqpoint{0.066cm}{1.518cm}}{\pgfqpoint{0.04cm}{1.492cm}}
\pgfpathcurveto{\pgfqpoint{0.014cm}{1.467cm}}{\pgfqpoint{0cm}{1.432cm}}{\pgfqpoint{0cm}{1.395cm}}
\pgfpathcurveto{\pgfqpoint{0cm}{1.359cm}}{\pgfqpoint{0.014cm}{1.324cm}}{\pgfqpoint{0.04cm}{1.299cm}}
\pgfpathcurveto{\pgfqpoint{0.066cm}{1.273cm}}{\pgfqpoint{0.1cm}{1.258cm}}{\pgfqpoint{0.137cm}{1.258cm}}
\pgfpathcurveto{\pgfqpoint{0.173cm}{1.258cm}}{\pgfqpoint{0.207cm}{1.273cm}}{\pgfqpoint{0.233cm}{1.299cm}}
\pgfpathcurveto{\pgfqpoint{0.259cm}{1.324cm}}{\pgfqpoint{0.273cm}{1.359cm}}{\pgfqpoint{0.273cm}{1.395cm}}
\pgfusepath{fill}
\begin{pgfscope}
\pgfsetdash{}{0cm}
\pgfsetlinewidth{0.818mm}
\pgfsetmiterlimit{7.0}
\pgfpathmoveto{\pgfqpoint{0.682cm}{0.671cm}}
\pgfpathlineto{\pgfqpoint{0.679cm}{1.418cm}}
\pgfusepath{stroke}
\end{pgfscope}
\pgfpathmoveto{\pgfqpoint{0.815cm}{1.399cm}}
\pgfpathcurveto{\pgfqpoint{0.815cm}{1.435cm}}{\pgfqpoint{0.801cm}{1.47cm}}{\pgfqpoint{0.775cm}{1.496cm}}
\pgfpathcurveto{\pgfqpoint{0.75cm}{1.521cm}}{\pgfqpoint{0.715cm}{1.536cm}}{\pgfqpoint{0.679cm}{1.536cm}}
\pgfpathcurveto{\pgfqpoint{0.643cm}{1.536cm}}{\pgfqpoint{0.608cm}{1.521cm}}{\pgfqpoint{0.582cm}{1.496cm}}
\pgfpathcurveto{\pgfqpoint{0.557cm}{1.47cm}}{\pgfqpoint{0.542cm}{1.435cm}}{\pgfqpoint{0.542cm}{1.399cm}}
\pgfpathcurveto{\pgfqpoint{0.542cm}{1.363cm}}{\pgfqpoint{0.557cm}{1.328cm}}{\pgfqpoint{0.582cm}{1.302cm}}
\pgfpathcurveto{\pgfqpoint{0.608cm}{1.276cm}}{\pgfqpoint{0.643cm}{1.262cm}}{\pgfqpoint{0.679cm}{1.262cm}}
\pgfpathcurveto{\pgfqpoint{0.715cm}{1.262cm}}{\pgfqpoint{0.75cm}{1.276cm}}{\pgfqpoint{0.775cm}{1.302cm}}
\pgfpathcurveto{\pgfqpoint{0.801cm}{1.328cm}}{\pgfqpoint{0.815cm}{1.363cm}}{\pgfqpoint{0.815cm}{1.399cm}}
\pgfusepath{fill}
\pgfpathmoveto{\pgfqpoint{1.345cm}{1.371cm}}
\pgfpathcurveto{\pgfqpoint{1.345cm}{1.408cm}}{\pgfqpoint{1.331cm}{1.442cm}}{\pgfqpoint{1.305cm}{1.468cm}}
\pgfpathcurveto{\pgfqpoint{1.28cm}{1.494cm}}{\pgfqpoint{1.245cm}{1.508cm}}{\pgfqpoint{1.209cm}{1.508cm}}
\pgfpathcurveto{\pgfqpoint{1.172cm}{1.508cm}}{\pgfqpoint{1.138cm}{1.494cm}}{\pgfqpoint{1.112cm}{1.468cm}}
\pgfpathcurveto{\pgfqpoint{1.087cm}{1.442cm}}{\pgfqpoint{1.072cm}{1.408cm}}{\pgfqpoint{1.072cm}{1.371cm}}
\pgfpathcurveto{\pgfqpoint{1.072cm}{1.335cm}}{\pgfqpoint{1.087cm}{1.3cm}}{\pgfqpoint{1.112cm}{1.274cm}}
\pgfpathcurveto{\pgfqpoint{1.138cm}{1.249cm}}{\pgfqpoint{1.172cm}{1.234cm}}{\pgfqpoint{1.209cm}{1.234cm}}
\pgfpathcurveto{\pgfqpoint{1.245cm}{1.234cm}}{\pgfqpoint{1.28cm}{1.249cm}}{\pgfqpoint{1.305cm}{1.274cm}}
\pgfpathcurveto{\pgfqpoint{1.331cm}{1.3cm}}{\pgfqpoint{1.345cm}{1.335cm}}{\pgfqpoint{1.345cm}{1.371cm}}
\pgfusepath{fill}
\begin{pgfscope}
\pgfsetdash{}{0cm}
\pgfsetlinewidth{0.818mm}
\pgfsetroundcap
\pgfsetmiterlimit{4.0}
\pgfpathmoveto{\pgfqpoint{0.682cm}{0.671cm}}
\pgfpathlineto{\pgfqpoint{0.682cm}{0.042cm}}
\pgfusepath{stroke}
\end{pgfscope}
\end{pgfscope}
\end{pgfscope}
\end{pgfscope}
\end{tikzpicture}}} + \tilde\phi + \tilde\psi)\prec X^{\!\resizebox{0.6em}{!}{
\begin{tikzpicture}
\pgfpathmoveto{\pgfqpoint{0cm}{0cm}}
\pgfpathlineto{\pgfqpoint{1.376cm}{0cm}}
\pgfpathlineto{\pgfqpoint{1.376cm}{1.588cm}}
\pgfpathlineto{\pgfqpoint{0cm}{1.588cm}}
\pgfpathclose
\pgfusepath{clip}
\begin{pgfscope}
\begin{pgfscope}
\pgfpathmoveto{\pgfqpoint{0cm}{0cm}}
\pgfpathlineto{\pgfqpoint{1.376cm}{0cm}}
\pgfpathlineto{\pgfqpoint{1.376cm}{1.588cm}}
\pgfpathlineto{\pgfqpoint{0cm}{1.588cm}}
\pgfpathclose
\pgfusepath{clip}
\begin{pgfscope}
\begin{pgfscope}
\definecolor{eps2pgf_color}{gray}{0.976471}\pgfsetstrokecolor{eps2pgf_color}\pgfsetfillcolor{eps2pgf_color}
\pgfpathmoveto{\pgfqpoint{0cm}{0cm}}
\pgfpathlineto{\pgfqpoint{1.376cm}{0cm}}
\pgfpathlineto{\pgfqpoint{1.376cm}{1.588cm}}
\pgfpathlineto{\pgfqpoint{0cm}{1.588cm}}
\pgfpathclose
\pgfusepath{fill}
\end{pgfscope}
\begin{pgfscope}
\pgfsetdash{}{0cm}
\pgfsetlinewidth{0.818mm}
\pgfsetroundcap
\pgfsetroundjoin
\pgfsetmiterlimit{7.0}
\definecolor{eps2pgf_color}{gray}{0}\pgfsetstrokecolor{eps2pgf_color}\pgfsetfillcolor{eps2pgf_color}
\pgfpathmoveto{\pgfqpoint{0.117cm}{1.476cm}}
\pgfpathlineto{\pgfqpoint{0.682cm}{0.726cm}}
\pgfpathlineto{\pgfqpoint{1.246cm}{1.476cm}}
\pgfusepath{stroke}
\end{pgfscope}
\definecolor{eps2pgf_color}{gray}{0}\pgfsetstrokecolor{eps2pgf_color}\pgfsetfillcolor{eps2pgf_color}
\pgfpathmoveto{\pgfqpoint{0.273cm}{1.451cm}}
\pgfpathcurveto{\pgfqpoint{0.273cm}{1.487cm}}{\pgfqpoint{0.259cm}{1.522cm}}{\pgfqpoint{0.233cm}{1.547cm}}
\pgfpathcurveto{\pgfqpoint{0.207cm}{1.573cm}}{\pgfqpoint{0.173cm}{1.588cm}}{\pgfqpoint{0.137cm}{1.588cm}}
\pgfpathcurveto{\pgfqpoint{0.1cm}{1.588cm}}{\pgfqpoint{0.066cm}{1.573cm}}{\pgfqpoint{0.04cm}{1.547cm}}
\pgfpathcurveto{\pgfqpoint{0.014cm}{1.522cm}}{\pgfqpoint{0cm}{1.487cm}}{\pgfqpoint{0cm}{1.451cm}}
\pgfpathcurveto{\pgfqpoint{0cm}{1.414cm}}{\pgfqpoint{0.014cm}{1.379cm}}{\pgfqpoint{0.04cm}{1.354cm}}
\pgfpathcurveto{\pgfqpoint{0.066cm}{1.328cm}}{\pgfqpoint{0.1cm}{1.314cm}}{\pgfqpoint{0.137cm}{1.314cm}}
\pgfpathcurveto{\pgfqpoint{0.173cm}{1.314cm}}{\pgfqpoint{0.207cm}{1.328cm}}{\pgfqpoint{0.233cm}{1.354cm}}
\pgfpathcurveto{\pgfqpoint{0.259cm}{1.379cm}}{\pgfqpoint{0.273cm}{1.414cm}}{\pgfqpoint{0.273cm}{1.451cm}}
\pgfusepath{fill}
\pgfpathmoveto{\pgfqpoint{1.345cm}{1.426cm}}
\pgfpathcurveto{\pgfqpoint{1.345cm}{1.463cm}}{\pgfqpoint{1.331cm}{1.497cm}}{\pgfqpoint{1.305cm}{1.523cm}}
\pgfpathcurveto{\pgfqpoint{1.28cm}{1.549cm}}{\pgfqpoint{1.245cm}{1.563cm}}{\pgfqpoint{1.209cm}{1.563cm}}
\pgfpathcurveto{\pgfqpoint{1.172cm}{1.563cm}}{\pgfqpoint{1.138cm}{1.549cm}}{\pgfqpoint{1.112cm}{1.523cm}}
\pgfpathcurveto{\pgfqpoint{1.087cm}{1.497cm}}{\pgfqpoint{1.072cm}{1.463cm}}{\pgfqpoint{1.072cm}{1.426cm}}
\pgfpathcurveto{\pgfqpoint{1.072cm}{1.39cm}}{\pgfqpoint{1.087cm}{1.355cm}}{\pgfqpoint{1.112cm}{1.329cm}}
\pgfpathcurveto{\pgfqpoint{1.138cm}{1.304cm}}{\pgfqpoint{1.172cm}{1.289cm}}{\pgfqpoint{1.209cm}{1.289cm}}
\pgfpathcurveto{\pgfqpoint{1.245cm}{1.289cm}}{\pgfqpoint{1.28cm}{1.304cm}}{\pgfqpoint{1.305cm}{1.329cm}}
\pgfpathcurveto{\pgfqpoint{1.331cm}{1.355cm}}{\pgfqpoint{1.345cm}{1.39cm}}{\pgfqpoint{1.345cm}{1.426cm}}
\pgfusepath{fill}
\begin{pgfscope}
\pgfsetdash{}{0cm}
\pgfsetlinewidth{0.818mm}
\pgfsetroundcap
\pgfsetmiterlimit{4.0}
\pgfpathmoveto{\pgfqpoint{0.682cm}{0.726cm}}
\pgfpathlineto{\pgfqpoint{0.682cm}{0.097cm}}
\pgfusepath{stroke}
\end{pgfscope}
\end{pgfscope}
\end{pgfscope}
\end{pgfscope}
\end{tikzpicture}}}
$$
and observe that due to Section \ref{ssec:theta} and Section \ref{ssec:psi1} (performed on $\mathbb{T}^{5}_{M}$) the right hand side $\Psi(\tilde\phi,\tilde\psi)$ belongs to $\CC^{\gamma}(\mathbb{T}^{5}_{M})$ provided   $(\tilde\phi,\tilde\psi)\in \CC^{\frac 12+\beta}(\mathbb{T}^{5}_{M})\times \CC^{1+\beta} (\mathbb{T}^{5}_{M})$ and $\gamma=\beta-\kappa$, $\gamma\leq\frac 12-3\kappa$. Hence Proposition \ref{prop:aux-1} implies  existence of a unique classical solution to the second equation in \eqref{eq:45d}. Hence the map $\mathcal{K}$ is well-defined.

Next, we deduce that  the map $\mathcal{K}$ has a fixed point $(\phi,\psi)\in \CC^{\frac 12+\alpha} (\mathbb{T}_{M}^{5})\times \CC^{2+\gamma}(\mathbb{T}_{M}^{5})$ for $\alpha=\beta+\kappa$ and $\gamma=\beta-\kappa$. More precisely, the proof follows the lines of Proposition \ref{prop:fixp} and employs the estimates from Sections \ref{ssec:phi1}, \ref{ssec:phi2},  \ref{ssec:theta}, \ref{ssec:psi1}, \ref{ssec:psi2}. The proof of existence on $\mathbb{T}^{5}_{M}$ is therefore complete.

 \emph{Step 2 -- existence on the full space:} For $M\in\N$ let  $(\phi_{M},\psi_{M})$ denote the solution to \eqref{eq:two}, \eqref{eq:th} on $\mathbb{T}^{5}_{M}$  constructed above.
Then  the a priori estimates from Sections \ref{ssec:phi1}, \ref{ssec:phi2},  \ref{ssec:theta}, \ref{ssec:psi1}, \ref{ssec:psi2} apply and, in view of Theorem~\ref{thm:renorm2}, we conclude that the approximate solutions $(\phi_{M},\psi_{M})$ are bounded uniformly in $M$ in
$$[ \CC^{\frac 12+\alpha}(\rho^{\frac 32+\alpha})\cap \CC^{\alpha}(\rho)]\times[\CC^{2+\gamma}(\rho^{3+\gamma})\cap L^{\infty}(\rho)]$$
whenever $\rho$ is a polynomial bound. Due to \eqref{eq:emb}, this space is compactly embedded into $\CC^{\frac 12+\alpha'}(\rho^{\frac 32+\alpha''})\times\CC^{2+\gamma'}(\rho^{3+\gamma''})$ provided $\alpha'<\alpha<\alpha''$ and $\gamma'<\gamma<\gamma''$. Therefore, there exists a subsequence, still denoted $(\phi_{M},\psi_{M})$ which converges in $\CC^{\frac 12+\alpha'}(\rho^{\frac 32+\alpha''})\times\CC^{2+\gamma'}(\rho^{3+\gamma''})$ to certain
$$(\phi,\psi)\in  [ \CC^{\frac 12+\alpha}(\rho^{\frac 32+\alpha})\cap \CC^{\alpha}(\rho)]\times[\CC^{2+\gamma}(\rho^{3+\gamma})\cap L^{\infty}(\rho)].$$
Passing to the limit in  \eqref{eq:two}, \eqref{eq:th} concludes the proof of existence on the full space.
\end{proof}

\section{Parabolic $\Phi^4_2$ model}
\label{sec:42}

The analysis of the parabolic $\Phi^{4}$ model on $\R^{2}$, that is,
\begin{equation}\label{eq:42}
\partial_{t}\varphi+ (- \Delta + \mu) \varphi + \varphi^3 - 3 a \varphi - \xi = 0,\qquad\varphi(0)=\varphi_{0},
\end{equation}
where $\xi$ is a space-time white noise, is very similar to the elliptic $\Phi^{4}_{4}$ model on $\R^{4}$. Indeed, the regularity of the space white noise in dimension $4$ is the same as the regularity of the space-time white noise in dimension $2$. Without loss of generality we assume that the mass $\mu$ is strictly positive (otherwise we add a linear term with positive mass to both sides of \eqref{eq:42} and consider the original massive term as a right hand side, see Remark \ref{rem:mu}). Then we proceed as in Section~\ref{sec:44}, let $(- \Delta +
\mu) = \Q$ and $\LL=\partial_{t}+\Q\hspace{-.2em}$ and introduce the ansatz
\[ \varphi = X + \phi + \psi, \]
with
\begin{equation*}
\LL X = \xi, \qquad \llbracket X^3 \rrbracket \assign X^3 - 3 a X, \quad
   \llbracket X^2 \rrbracket \assign X^2 - a.
\end{equation*}
Recall that $X$ is chosen stationary.
This leads us to the system of equations
\begin{equation}\label{eq:42b}
\LL \phi + \Phi = 0, \quad \phi(0)=\phi_{0}=\varphi_{0},\qquad \LL \psi + \psi^3 + \Psi = 0,\quad \psi(0)=0,
\end{equation}
where $\varphi_{0}\in \CC^{\alpha}(\rho_{0})$; and  $\Phi$ and $\Psi$ are given as in \eqref{eq:44c} but employing the parabolic localizers $\VV_{>},\VV_{\leq}$ instead of $\UU_{>},\UU_{\leq}$.

The existence of a solution can now be proved by choosing a smooth and space periodic approximation $\xi_{\varepsilon}$ of the driving space-time white noise $\xi$, defined on the torus of size $\frac 1\varepsilon$ and solving \eqref{eq:42} on the approximate level with the associated renormalization constant $a_{\varepsilon}$. Subsequently, we may pass to the limit using  the above uniform estimates together with  compactness. To be more precise, let $\xi_{\varepsilon}$ be a periodic version of a {space-time mollification} of $\xi$ defined on $[0,\8)\times\mathbb{T}^{2}_{1/\varepsilon}$ and define $X_{\varepsilon}$ as the stationary solution to
\begin{equation*}
\LL X_{\varepsilon} = \xi_{\varepsilon}.
\end{equation*}
The other stochastic objects were defined in Theorem \ref{thm:renorm42}. Throughout this section, $\rho$ denotes   a polynomial space-time weight.

Let $\varphi_{\varepsilon,0}$ be a mollification of the initial condition $\varphi_{0}$. Then according to Proposition \ref{prop:42d}, for every $\varepsilon\in (0,1)$ there exists $\varphi_{\varepsilon}\in C^{\infty}([0,\8)\times \mathbb{T}^{2}_{1/\varepsilon})$ which is the unique classical solution to
\begin{equation*}
\LL \varphi_{\varepsilon} + \varphi_{\varepsilon}^3 - 3 a_{\varepsilon} \varphi_{\varepsilon} - \xi_{\varepsilon} = 0,\qquad\varphi(0)=\varphi_{\varepsilon,0}.
\end{equation*}
As the next step, we proceed with the same decomposition $\varphi_{\varepsilon}=X_{\varepsilon}+\phi_{\varepsilon}+\psi_{\varepsilon}$ as above, only starting from the mollified version $\xi_{\varepsilon}$ instead of from $\xi$. 
According to Corollary \ref{cor:42d} it holds for every $\varepsilon\in (0,1)$ that $\varphi_{\varepsilon}\in  C\CC^{2 + \kappa} (\rho^{3 + \kappa})\cap C^{1}L^{\infty}(\rho^{3+\kappa}) \cap L^{\infty}L^{\infty} (\rho)$ and the  same regularity holds for $\phi_{\varepsilon},\psi_{\varepsilon}$. Hence we follow the lines of  Section~\ref{ssec:apr} and employ  Lemmas \ref{lemma:interp2}, \ref{lem:local2}, \ref{lemma:sch}, \ref{lemma:schauder-par}, \ref{lemma:apriori-parabolic}, in order to deduce that the following bound holds true uniformly in $\varepsilon\in (0,1)$
\begin{equation}\label{eq:un42}
\|\phi_{\varepsilon}\|_{C\CC^{\alpha}(\rho)}+\|\phi_{\varepsilon}\|_{C^{\alpha/2}L^{\infty}(\rho)}+\|\psi_{\varepsilon}\|_{C\CC^{2+\beta}(\rho^{3+\beta})}+\|\psi_{\varepsilon}\|_{C^{1}L^{\infty}(\rho^{3+\beta})}+\|\psi_{\varepsilon}\|_{L^{\infty}L^{\infty}(\rho)}\lesssim 1.
\end{equation}
Based on this uniform bound we are able to pass to the limit.

\begin{theorem}\label{thm:ex42}
Let $\kappa, \alpha\in (0,1)$ be chosen sufficiently small and let $\beta=\alpha-\kappa>0$. If $\varphi_{0}\in \CC^{\alpha}(\rho_{0})$ then there exists
$$(\phi,\psi)\in [C\CC^{\alpha}(\rho)\cap C^{\alpha/2}L^{\infty}(\rho)] \times[C\CC^{2+\beta}(\rho^{3+\beta})\cap C^{1}L^{\infty}(\rho^{3+\beta})\cap L^{\infty}L^{\infty}(\rho)]$$
which is a solution to  \eqref{eq:42b}.
\end{theorem}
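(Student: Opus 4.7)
The plan is to carry out the approximation-compactness argument sketched by the authors just before the statement. First, I would invoke Proposition~\ref{prop:42d} (and its Corollary~\ref{cor:42d}) from the appendix to obtain, for each $\varepsilon \in (0,1)$, a smooth classical global solution $\varphi_\varepsilon$ to the renormalized, periodized and mollified equation on $[0,\infty)\times \mathbb{T}^2_{1/\varepsilon}$ with initial datum $\varphi_{\varepsilon,0}$, and then decompose $\varphi_\varepsilon = X_\varepsilon + \phi_\varepsilon + \psi_\varepsilon$ where $(\phi_\varepsilon,\psi_\varepsilon)$ solves the parabolic analog of \eqref{eq:44b}--\eqref{eq:44c} with the parabolic localizers $\VV_{>},\VV_{\leqslant}$ of Lemma~\ref{lem:local2} and the mollified objects $X_\varepsilon$, $\llbracket X_\varepsilon^2\rrbracket$, $\llbracket X_\varepsilon^3\rrbracket$ in place of $X$, $\llbracket X^2\rrbracket$, $\llbracket X^3\rrbracket$. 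The initial condition is carried entirely by $\phi_\varepsilon$ (so that $\psi_\varepsilon(0)=0$), thanks to the Schauder bound of Lemma~\ref{lemma:sch} for the linear part of the $\phi$-equation in the weighted norms, which handles the homogeneous propagation of $\varphi_{\varepsilon,0}\in\CC^\alpha(\rho_0)$.

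Second, I would rerun the a priori estimates of Section~\ref{ssec:apr} in the parabolic weighted setting. Every ingredient has already been prepared: the parabolic interpolation of Lemma~\ref{lemma:interp2}, the space-time localization Lemma~\ref{lem:local2} (used on the uniformly bounded stochastic inputs of Theorem~\ref{thm:renorm42}), the Schauder estimate Lemma~\ref{lemma:schauder-par} for the cubic equation, and the weighted maximum principle Lemma~\ref{lemma:apriori-parabolic}. The choice of the localization parameter $K$ is driven by $1 + \|\phi_\varepsilon+\psi_\varepsilon\|_{L^\infty L^\infty(\rho)} = 2^{(2-\kappa-\alpha)K/2}$ exactly as in the elliptic case, and the resulting chain of inequalities terminates in \eqref{eq:un42} via the weighted Young inequality. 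The time regularity statements for $\phi_\varepsilon$ and $\psi_\varepsilon$ follow directly from the time-regularity parts of Lemma~\ref{lemma:sch} applied to the right-hand sides, whose bounds were established in the previous step.

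Third, I would extract a convergent subsequence. The compact embedding \eqref{eq:emb}, applied pointwise in time, together with the uniform time-Hölder bound from \eqref{eq:un42}, yields via a standard Arzel\`a--Ascoli argument (diagonalized over compact time intervals $[0,T]$ and over shrinking parameters $\alpha'<\alpha$, $\beta'<\beta<\beta''$) a subsequence with
\[
\phi_\varepsilon \to \phi \ \text{in}\ C([0,T];\CC^{\alpha'}(\rho^{1+\alpha'})), \qquad
\psi_\varepsilon \to \psi \ \text{in}\ C([0,T];\CC^{2+\beta'}(\rho^{3+\beta''})),
\]
while the $L^\infty L^\infty(\rho)$ bound on $\psi_\varepsilon$ and the $C\CC^\alpha(\rho)$, $C^{\alpha/2}L^\infty(\rho)$, $C\CC^{2+\beta}(\rho^{3+\beta})$, $C^1 L^\infty(\rho^{3+\beta})$ bounds pass to the limit by (weak-$*$) lower semicontinuity. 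Simultaneously, Theorem~\ref{thm:renorm42} gives $X_\varepsilon\to X$, $\llbracket X_\varepsilon^2\rrbracket \to \llbracket X^2\rrbracket$, $\llbracket X_\varepsilon^3\rrbracket \to \llbracket X^3\rrbracket$ in $C\CC^{-\kappa}(\rho^\sigma)$.

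Finally, I would pass to the limit in the decomposed system. The main obstacle here is the identification of the nonlinear terms: the paraproduct and resonant estimates of Lemma~\ref{lem:para}, combined with the continuity of the localizers $\VV_{>},\VV_{\leqslant}$ on the relevant weighted Besov scales, allow one to match each term in the equation for $(\phi_\varepsilon,\psi_\varepsilon)$ with its candidate limit, provided one has \emph{strict} improvements in either regularity or weight on $(\phi_\varepsilon,\psi_\varepsilon)$---which is exactly what the strictness of the compact embedding \eqref{eq:emb} furnishes. The cubic term $\psi_\varepsilon^3$ converges thanks to the uniform $L^\infty L^\infty(\rho)$ bound and the strong $C\CC^{2+\beta'}(\rho^{3+\beta''})$ convergence. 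Once all the terms have been matched, $(\phi,\psi)$ is a solution to \eqref{eq:42b} in the claimed function spaces, completing the proof.
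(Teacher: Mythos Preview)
Your proposal is correct and follows essentially the same route as the paper: mollify and periodize, invoke Proposition~\ref{prop:42d} and Corollary~\ref{cor:42d}, rerun the elliptic a priori estimates in parabolic form to obtain \eqref{eq:un42}, extract a subsequence by compactness, and pass to the limit using Theorem~\ref{thm:renorm42} and the paraproduct estimates. The only point you gloss over that the paper treats explicitly is the recovery of \emph{continuity} in time in the sharp spaces: weak-$*$ lower semicontinuity of the Littlewood--Paley block norms only yields $L^\infty\CC^\alpha(\rho)$ and $L^\infty\CC^{2+\beta}(\rho^{3+\beta})$, and the paper then uses the strong time-H\"older convergence to get $C^{\alpha/2}L^\infty(\rho)$ and $C^1 L^\infty(\rho^{3+\beta})$, after which a final application of the Schauder estimates (Lemma~\ref{lemma:sch} and Lemma~\ref{lemma:schauder-par}) upgrades $L^\infty$ in time to $C$ in time.
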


\begin{proof}
Due to \eqref{eq:un42}, the approximate solutions $(\phi_{\varepsilon},\psi_{\varepsilon})$ are bounded uniformly in $\varepsilon
\in (0,1)$ in
$$[C\CC^{\alpha}(\rho)\cap C^{\alpha/2}L^{\infty}(\rho)] \times[C\CC^{2+\beta}(\rho^{3+\beta})\cap C^{1}L^{\infty}(\rho^{3+\beta})\cap L^{\infty}L^{\infty}(\rho)]$$
whenever $\rho$ is a polynomial bound. Due to \eqref{eq:emb}, Arzel\`a-Ascoli and Aubin-Lions-type argument (see \cite[Lemma 1, Theorem 5]{S87}) this space is compactly embedded into
$$[C_{\rm{loc}}\CC^{\alpha-\iota}(\rho^{1+\delta})\cap C^{(\alpha-\iota)/2}_{\rm{loc}}\CC^{-\gamma}(\rho^{1+\delta}) ]\times [C_{\rm{loc}}\CC^{2+\beta-\iota}(\rho^{3+\beta+\delta})\cap C^{1-\iota}_{\rm{loc}}\CC^{-\gamma}(\rho^{3+\beta+\delta}) ]$$ provided  $\iota\in (0,\alpha\wedge\beta\wedge1)$ and $\gamma,\delta\in (0,1)$ are chosen small. Therefore, there exists a subsequence, still denoted $(\phi_{\varepsilon},\psi_{\varepsilon})$ which converges in this space to certain $(\phi,\psi)$ and we intend to pass to the limit in  \eqref{eq:42b}.

To this end, we fix $T>0$. Note that due to Theorem~\ref{thm:renorm42}, the linearity of the localizers $\VV_{>},\VV_{\leq}$ and Lemma \ref{lem:local2} it follows that
\begin{align*}
\VV_{>} X_{\varepsilon}\to \VV_{>} X\quad\text{in}\quad C_{T}\CC^{\alpha-2} (\rho^{- 1}),
   &\qquad \VV_{\leqslant} X_{\varepsilon} \to \VV_{\leqslant} X\quad\text{in}\quad C_{T}\CC^{\alpha} (\rho^{1}),\\
 \VV_{>} \llbracket X_{\varepsilon}^2 \rrbracket \to  \VV_{>} \llbracket X^2 \rrbracket \quad\text{in}\quad C_{T}\CC^{\alpha-2},
 &\qquad  \VV_{\leqslant} \llbracket X_{\varepsilon}^2 \rrbracket\to  \VV_{\leqslant} \llbracket X^2 \rrbracket
  \quad\text{in}\quad C_{T}\CC^{\alpha} (\rho^{2}).
\end{align*}
Note that  we employed the same spaces as for the a priori estimates in Section \ref{ssec:apr}. As a consequence and in view of the estimates from Section \ref{ssec:apr}, we observe that there exists $K>0$ such that
$$\Phi_{\varepsilon}\to\Phi\quad\text{in}\quad C_{T}\CC^{\alpha-2}(\rho^{K}),\qquad \Psi_{\varepsilon}\to\Psi\quad\text{in}\quad C_{T}\CC^{\beta-\iota}(\rho^{K}),$$
where $\Phi,\Psi$ are defined as in \eqref{eq:44c}. The constant $K>0$ needs to be  chosen sufficiently large in order to compensate for the lack of convergence of $\phi_{\varepsilon}$ and $\psi_{\varepsilon}$ in $C_{T}L^{\infty}(\rho)$, which has to be replaced by $C_{T}L^{\infty}(\rho^{1+\delta})$ and $C_{T}L^{\infty}(\rho^{3+\beta+\delta})$, respectively. Passing to the limit in the remaining terms in \eqref{eq:42b} is straightforward, and therefore, the couple $(\phi,\psi)$ solves \eqref{eq:42b}, which is understood in distributional sense.

It remains to show that
$$(\phi,\psi)\in [C\CC^{\alpha}(\rho)\cap C^{\alpha/2}L^{\infty}(\rho)] \times[C\CC^{2+\beta}(\rho^{3+\beta})\cap C^{1}L^{\infty}(\rho^{3+\beta})\cap L^{\infty}L^{\infty}(\rho)].$$
To this end, we observe that according to the above convergence $(\phi_{\varepsilon},\psi_{\varepsilon})\to (\phi,\psi)$ it follows that $\Delta_{i}\phi_{\varepsilon}(t,x)\to \Delta_{i}\phi(t,x)$ and $\Delta_{i}\psi_{\varepsilon}(t,x)\to \Delta_{i}\psi(t,x)$ for every $i\geq -1$ and almost every $(t,x)\in [0,\8)\times\R^{2}$. In addition, the Littlewood--Paley blocks $\Delta_{i}\phi_{\varepsilon}, \Delta_{i}\psi_{\varepsilon}$ satisfy the uniform  bounds (even uniform in $\varepsilon,i$ and $t$)
$$
\|\rho_{t}\Delta_{i}\phi_{\varepsilon}(t)\|_{L^{\infty}}\lesssim 1,\qquad \|\rho_{t}\Delta_{i}\psi_{\varepsilon}(t)\|_{L^{\infty}}\lesssim 1.
$$
Consequently,  $\rho_{t}\Delta_{i}\phi_{\varepsilon}(t)\to \rho_{t}\Delta_{i}\phi(t)$ and $\rho_{t}\Delta_{i}\psi_{\varepsilon}(t)\to \rho_{t}\Delta_{i}\psi(t)$  weak-star in $L^{\infty}(\R^{2})$ for every $i\geq -1$ and almost every $t\in [0,\8)$. Since the $L^{\infty}$-norm is weak-star lower semicontinuous, we obtain
$$
\|\rho_{t}\Delta_{i}\phi(t)\|_{L^{\infty}}\leqslant \liminf_{\varepsilon\to 0}\|\rho_{t}\Delta_{i}\phi_{\varepsilon}(t)\|_{L^{\infty}}\leqslant \liminf_{\varepsilon\to 0}\|\phi_{\varepsilon}\|_{C_{T}\CC^{\alpha}(\rho)}2^{-i\alpha}\lesssim 2^{-i\alpha},
$$
$$
\|\rho_{t}\Delta_{i}\psi(t)\|_{L^{\infty}}\leqslant \liminf_{\varepsilon\to 0}\|\rho_{t}\Delta_{i}\psi_{\varepsilon}(t)\|_{L^{\infty}}\leqslant \liminf_{\varepsilon\to 0}\|\psi_{\varepsilon}\|_{C_{T}L^{\infty}(\rho)}\lesssim 1,
$$
and by the same argument
$$
\|\rho_{t}^{3+\beta}\Delta_{i}\psi(t)\|_{L^{\infty}}\leqslant \liminf_{\varepsilon\to 0}\|\rho_{t}^{3+\beta}\Delta_{i}\psi_{\varepsilon}(t)\|_{L^{\infty}}\leqslant \liminf_{\varepsilon\to 0}\|\psi_{\varepsilon}(t)\|_{C_{T}\CC^{2+\beta}(\rho^{3+\beta})}\lesssim 2^{-i(2+\beta)}.
$$
This implies that
$$(\phi,\psi)\in L^{\infty}\CC^{\alpha}(\rho) \times[L^{\infty}\CC^{2+\beta}(\rho^{3+\beta})\cap  L^{\infty}L^{\infty}(\rho)].$$
Now, using the convergence $(\phi_{\varepsilon},\psi_{\varepsilon})\to (\phi,\psi)$ in $C_{ \rm{loc}}^{\alpha/2}L^{\infty}(\rho)\times C^{1}_{ \rm{loc}}L^{\infty}(\rho)$ we obtain
$$
\|\phi(t)-\phi(s)\|_{L^{\infty}(\rho)}=\lim_{\varepsilon\to 0}\|\phi_{\varepsilon}(t)-\phi_{\varepsilon}(s)\|_{L^{\infty}(\rho)}\leqslant \lim_{\varepsilon\to 0}\|\phi_{\varepsilon}\|_{C^{\alpha/2}L^{\infty}(\rho)}|t-s|^{\alpha/2}\lesssim |t-s|^{\alpha/2}
$$
and similarly for the norm of $\psi$ in $C^{1}_{T}L^{\infty}(\rho^{3+\beta}).$ Hence
$$(\phi,\psi)\in C^{\alpha/2}_{T}L^{\infty}(\rho) \times C^{1}_{T}L^{\infty}(\rho^{{3+\beta}}).$$
Now,  we apply the Schauder estimates for both $\phi$ and $\psi$ (i.e. Lemma \ref{lemma:sch} and Lemma \ref{lemma:schauder-par}) to obtain continuity in time, namely,
$$
(\phi,\psi)\in C_{T}\CC^{\alpha}(\rho)\times C_{T}\CC^{2+\beta}(\rho^{3+\beta}).
$$
The proof is complete.
\end{proof}

\section{Parabolic $\Phi^4_3$ model}
\label{sec:43}

We proceed by similar arguments as  in the elliptic $\Phi^4_5$ model discussed in Section \ref{sec:45}. More precisely, we intend to study the parabolic equation
\begin{equation}\label{eq:43}
\partial_{t} \varphi+(- \Delta + \mu) \varphi + \varphi^3 + (- 3 a + 3b) \varphi - \xi = 0,\qquad\varphi(0)=\varphi_{0},
\end{equation}
in $\mathbbm{R}^3$ where $\xi$ is a space-time white noise, $a,b$ stand for renormalization constants. We recall that according to Theorem \ref{thm:renorm43} the renormalization constant $b$ depends on time and is bounded.
Without loss of generality we assume $\mu>0$ (see Remark~\ref{rem:mu}).

While the existence of solutions to the parabolic $\Phi^{4}$ model in dimension 2 was a more or less straightforward consequence of the elliptic a priori estimates for the $\Phi^{4}_{4}$ model from Section \ref{ssec:apr}, the situation is more involved in dimension $3$, which will be seen in the sequel. To be more precise, we let $(- \Delta +
\mu) = \Q$ and $\LL=\partial_{t}+\Q\hspace{-.2em}$
 and we introduce the ansatz
\[ \varphi = X - X^{\!\resizebox{0.6em}{!}{
\begin{tikzpicture}
\pgfpathmoveto{\pgfqpoint{0cm}{-0.035cm}}
\pgfpathlineto{\pgfqpoint{1.376cm}{-0.035cm}}
\pgfpathlineto{\pgfqpoint{1.376cm}{1.552cm}}
\pgfpathlineto{\pgfqpoint{0cm}{1.552cm}}
\pgfpathclose
\pgfusepath{clip}
\begin{pgfscope}
\begin{pgfscope}
\pgfpathmoveto{\pgfqpoint{0cm}{-0.035cm}}
\pgfpathlineto{\pgfqpoint{1.376cm}{-0.035cm}}
\pgfpathlineto{\pgfqpoint{1.376cm}{1.552cm}}
\pgfpathlineto{\pgfqpoint{0cm}{1.552cm}}
\pgfpathclose
\pgfusepath{clip}
\begin{pgfscope}
\begin{pgfscope}
\pgfsetdash{}{0cm}
\pgfsetlinewidth{0.818mm}
\pgfsetroundcap
\pgfsetroundjoin
\pgfsetmiterlimit{7.0}
\definecolor{eps2pgf_color}{gray}{0}\pgfsetstrokecolor{eps2pgf_color}\pgfsetfillcolor{eps2pgf_color}
\pgfpathmoveto{\pgfqpoint{0.117cm}{1.421cm}}
\pgfpathlineto{\pgfqpoint{0.682cm}{0.671cm}}
\pgfpathlineto{\pgfqpoint{1.246cm}{1.421cm}}
\pgfusepath{stroke}
\end{pgfscope}
\definecolor{eps2pgf_color}{gray}{0}\pgfsetstrokecolor{eps2pgf_color}\pgfsetfillcolor{eps2pgf_color}
\pgfpathmoveto{\pgfqpoint{0.273cm}{1.395cm}}
\pgfpathcurveto{\pgfqpoint{0.273cm}{1.432cm}}{\pgfqpoint{0.259cm}{1.467cm}}{\pgfqpoint{0.233cm}{1.492cm}}
\pgfpathcurveto{\pgfqpoint{0.207cm}{1.518cm}}{\pgfqpoint{0.173cm}{1.532cm}}{\pgfqpoint{0.137cm}{1.532cm}}
\pgfpathcurveto{\pgfqpoint{0.1cm}{1.532cm}}{\pgfqpoint{0.066cm}{1.518cm}}{\pgfqpoint{0.04cm}{1.492cm}}
\pgfpathcurveto{\pgfqpoint{0.014cm}{1.467cm}}{\pgfqpoint{0cm}{1.432cm}}{\pgfqpoint{0cm}{1.395cm}}
\pgfpathcurveto{\pgfqpoint{0cm}{1.359cm}}{\pgfqpoint{0.014cm}{1.324cm}}{\pgfqpoint{0.04cm}{1.299cm}}
\pgfpathcurveto{\pgfqpoint{0.066cm}{1.273cm}}{\pgfqpoint{0.1cm}{1.258cm}}{\pgfqpoint{0.137cm}{1.258cm}}
\pgfpathcurveto{\pgfqpoint{0.173cm}{1.258cm}}{\pgfqpoint{0.207cm}{1.273cm}}{\pgfqpoint{0.233cm}{1.299cm}}
\pgfpathcurveto{\pgfqpoint{0.259cm}{1.324cm}}{\pgfqpoint{0.273cm}{1.359cm}}{\pgfqpoint{0.273cm}{1.395cm}}
\pgfusepath{fill}
\begin{pgfscope}
\pgfsetdash{}{0cm}
\pgfsetlinewidth{0.818mm}
\pgfsetmiterlimit{7.0}
\pgfpathmoveto{\pgfqpoint{0.682cm}{0.671cm}}
\pgfpathlineto{\pgfqpoint{0.679cm}{1.418cm}}
\pgfusepath{stroke}
\end{pgfscope}
\pgfpathmoveto{\pgfqpoint{0.815cm}{1.399cm}}
\pgfpathcurveto{\pgfqpoint{0.815cm}{1.435cm}}{\pgfqpoint{0.801cm}{1.47cm}}{\pgfqpoint{0.775cm}{1.496cm}}
\pgfpathcurveto{\pgfqpoint{0.75cm}{1.521cm}}{\pgfqpoint{0.715cm}{1.536cm}}{\pgfqpoint{0.679cm}{1.536cm}}
\pgfpathcurveto{\pgfqpoint{0.643cm}{1.536cm}}{\pgfqpoint{0.608cm}{1.521cm}}{\pgfqpoint{0.582cm}{1.496cm}}
\pgfpathcurveto{\pgfqpoint{0.557cm}{1.47cm}}{\pgfqpoint{0.542cm}{1.435cm}}{\pgfqpoint{0.542cm}{1.399cm}}
\pgfpathcurveto{\pgfqpoint{0.542cm}{1.363cm}}{\pgfqpoint{0.557cm}{1.328cm}}{\pgfqpoint{0.582cm}{1.302cm}}
\pgfpathcurveto{\pgfqpoint{0.608cm}{1.276cm}}{\pgfqpoint{0.643cm}{1.262cm}}{\pgfqpoint{0.679cm}{1.262cm}}
\pgfpathcurveto{\pgfqpoint{0.715cm}{1.262cm}}{\pgfqpoint{0.75cm}{1.276cm}}{\pgfqpoint{0.775cm}{1.302cm}}
\pgfpathcurveto{\pgfqpoint{0.801cm}{1.328cm}}{\pgfqpoint{0.815cm}{1.363cm}}{\pgfqpoint{0.815cm}{1.399cm}}
\pgfusepath{fill}
\pgfpathmoveto{\pgfqpoint{1.345cm}{1.371cm}}
\pgfpathcurveto{\pgfqpoint{1.345cm}{1.408cm}}{\pgfqpoint{1.331cm}{1.442cm}}{\pgfqpoint{1.305cm}{1.468cm}}
\pgfpathcurveto{\pgfqpoint{1.28cm}{1.494cm}}{\pgfqpoint{1.245cm}{1.508cm}}{\pgfqpoint{1.209cm}{1.508cm}}
\pgfpathcurveto{\pgfqpoint{1.172cm}{1.508cm}}{\pgfqpoint{1.138cm}{1.494cm}}{\pgfqpoint{1.112cm}{1.468cm}}
\pgfpathcurveto{\pgfqpoint{1.087cm}{1.442cm}}{\pgfqpoint{1.072cm}{1.408cm}}{\pgfqpoint{1.072cm}{1.371cm}}
\pgfpathcurveto{\pgfqpoint{1.072cm}{1.335cm}}{\pgfqpoint{1.087cm}{1.3cm}}{\pgfqpoint{1.112cm}{1.274cm}}
\pgfpathcurveto{\pgfqpoint{1.138cm}{1.249cm}}{\pgfqpoint{1.172cm}{1.234cm}}{\pgfqpoint{1.209cm}{1.234cm}}
\pgfpathcurveto{\pgfqpoint{1.245cm}{1.234cm}}{\pgfqpoint{1.28cm}{1.249cm}}{\pgfqpoint{1.305cm}{1.274cm}}
\pgfpathcurveto{\pgfqpoint{1.331cm}{1.3cm}}{\pgfqpoint{1.345cm}{1.335cm}}{\pgfqpoint{1.345cm}{1.371cm}}
\pgfusepath{fill}
\begin{pgfscope}
\pgfsetdash{}{0cm}
\pgfsetlinewidth{0.818mm}
\pgfsetroundcap
\pgfsetmiterlimit{4.0}
\pgfpathmoveto{\pgfqpoint{0.682cm}{0.671cm}}
\pgfpathlineto{\pgfqpoint{0.682cm}{0.042cm}}
\pgfusepath{stroke}
\end{pgfscope}
\end{pgfscope}
\end{pgfscope}
\end{pgfscope}
\end{tikzpicture}}} + \phi + \psi\]
with $X$ being stationary and
\[\LL X = \xi, \quad \llbracket X^2
   \rrbracket \assign X^2 - a,\quad \llbracket X^3 \rrbracket \assign X^3 - 3 a X, \]
\[  \LL X^{\!\resizebox{0.6em}{!}{
\begin{tikzpicture}
\pgfpathmoveto{\pgfqpoint{0cm}{-0.035cm}}
\pgfpathlineto{\pgfqpoint{1.376cm}{-0.035cm}}
\pgfpathlineto{\pgfqpoint{1.376cm}{1.552cm}}
\pgfpathlineto{\pgfqpoint{0cm}{1.552cm}}
\pgfpathclose
\pgfusepath{clip}
\begin{pgfscope}
\begin{pgfscope}
\pgfpathmoveto{\pgfqpoint{0cm}{-0.035cm}}
\pgfpathlineto{\pgfqpoint{1.376cm}{-0.035cm}}
\pgfpathlineto{\pgfqpoint{1.376cm}{1.552cm}}
\pgfpathlineto{\pgfqpoint{0cm}{1.552cm}}
\pgfpathclose
\pgfusepath{clip}
\begin{pgfscope}
\begin{pgfscope}
\pgfsetdash{}{0cm}
\pgfsetlinewidth{0.818mm}
\pgfsetroundcap
\pgfsetroundjoin
\pgfsetmiterlimit{7.0}
\definecolor{eps2pgf_color}{gray}{0}\pgfsetstrokecolor{eps2pgf_color}\pgfsetfillcolor{eps2pgf_color}
\pgfpathmoveto{\pgfqpoint{0.117cm}{1.421cm}}
\pgfpathlineto{\pgfqpoint{0.682cm}{0.671cm}}
\pgfpathlineto{\pgfqpoint{1.246cm}{1.421cm}}
\pgfusepath{stroke}
\end{pgfscope}
\definecolor{eps2pgf_color}{gray}{0}\pgfsetstrokecolor{eps2pgf_color}\pgfsetfillcolor{eps2pgf_color}
\pgfpathmoveto{\pgfqpoint{0.273cm}{1.395cm}}
\pgfpathcurveto{\pgfqpoint{0.273cm}{1.432cm}}{\pgfqpoint{0.259cm}{1.467cm}}{\pgfqpoint{0.233cm}{1.492cm}}
\pgfpathcurveto{\pgfqpoint{0.207cm}{1.518cm}}{\pgfqpoint{0.173cm}{1.532cm}}{\pgfqpoint{0.137cm}{1.532cm}}
\pgfpathcurveto{\pgfqpoint{0.1cm}{1.532cm}}{\pgfqpoint{0.066cm}{1.518cm}}{\pgfqpoint{0.04cm}{1.492cm}}
\pgfpathcurveto{\pgfqpoint{0.014cm}{1.467cm}}{\pgfqpoint{0cm}{1.432cm}}{\pgfqpoint{0cm}{1.395cm}}
\pgfpathcurveto{\pgfqpoint{0cm}{1.359cm}}{\pgfqpoint{0.014cm}{1.324cm}}{\pgfqpoint{0.04cm}{1.299cm}}
\pgfpathcurveto{\pgfqpoint{0.066cm}{1.273cm}}{\pgfqpoint{0.1cm}{1.258cm}}{\pgfqpoint{0.137cm}{1.258cm}}
\pgfpathcurveto{\pgfqpoint{0.173cm}{1.258cm}}{\pgfqpoint{0.207cm}{1.273cm}}{\pgfqpoint{0.233cm}{1.299cm}}
\pgfpathcurveto{\pgfqpoint{0.259cm}{1.324cm}}{\pgfqpoint{0.273cm}{1.359cm}}{\pgfqpoint{0.273cm}{1.395cm}}
\pgfusepath{fill}
\begin{pgfscope}
\pgfsetdash{}{0cm}
\pgfsetlinewidth{0.818mm}
\pgfsetmiterlimit{7.0}
\pgfpathmoveto{\pgfqpoint{0.682cm}{0.671cm}}
\pgfpathlineto{\pgfqpoint{0.679cm}{1.418cm}}
\pgfusepath{stroke}
\end{pgfscope}
\pgfpathmoveto{\pgfqpoint{0.815cm}{1.399cm}}
\pgfpathcurveto{\pgfqpoint{0.815cm}{1.435cm}}{\pgfqpoint{0.801cm}{1.47cm}}{\pgfqpoint{0.775cm}{1.496cm}}
\pgfpathcurveto{\pgfqpoint{0.75cm}{1.521cm}}{\pgfqpoint{0.715cm}{1.536cm}}{\pgfqpoint{0.679cm}{1.536cm}}
\pgfpathcurveto{\pgfqpoint{0.643cm}{1.536cm}}{\pgfqpoint{0.608cm}{1.521cm}}{\pgfqpoint{0.582cm}{1.496cm}}
\pgfpathcurveto{\pgfqpoint{0.557cm}{1.47cm}}{\pgfqpoint{0.542cm}{1.435cm}}{\pgfqpoint{0.542cm}{1.399cm}}
\pgfpathcurveto{\pgfqpoint{0.542cm}{1.363cm}}{\pgfqpoint{0.557cm}{1.328cm}}{\pgfqpoint{0.582cm}{1.302cm}}
\pgfpathcurveto{\pgfqpoint{0.608cm}{1.276cm}}{\pgfqpoint{0.643cm}{1.262cm}}{\pgfqpoint{0.679cm}{1.262cm}}
\pgfpathcurveto{\pgfqpoint{0.715cm}{1.262cm}}{\pgfqpoint{0.75cm}{1.276cm}}{\pgfqpoint{0.775cm}{1.302cm}}
\pgfpathcurveto{\pgfqpoint{0.801cm}{1.328cm}}{\pgfqpoint{0.815cm}{1.363cm}}{\pgfqpoint{0.815cm}{1.399cm}}
\pgfusepath{fill}
\pgfpathmoveto{\pgfqpoint{1.345cm}{1.371cm}}
\pgfpathcurveto{\pgfqpoint{1.345cm}{1.408cm}}{\pgfqpoint{1.331cm}{1.442cm}}{\pgfqpoint{1.305cm}{1.468cm}}
\pgfpathcurveto{\pgfqpoint{1.28cm}{1.494cm}}{\pgfqpoint{1.245cm}{1.508cm}}{\pgfqpoint{1.209cm}{1.508cm}}
\pgfpathcurveto{\pgfqpoint{1.172cm}{1.508cm}}{\pgfqpoint{1.138cm}{1.494cm}}{\pgfqpoint{1.112cm}{1.468cm}}
\pgfpathcurveto{\pgfqpoint{1.087cm}{1.442cm}}{\pgfqpoint{1.072cm}{1.408cm}}{\pgfqpoint{1.072cm}{1.371cm}}
\pgfpathcurveto{\pgfqpoint{1.072cm}{1.335cm}}{\pgfqpoint{1.087cm}{1.3cm}}{\pgfqpoint{1.112cm}{1.274cm}}
\pgfpathcurveto{\pgfqpoint{1.138cm}{1.249cm}}{\pgfqpoint{1.172cm}{1.234cm}}{\pgfqpoint{1.209cm}{1.234cm}}
\pgfpathcurveto{\pgfqpoint{1.245cm}{1.234cm}}{\pgfqpoint{1.28cm}{1.249cm}}{\pgfqpoint{1.305cm}{1.274cm}}
\pgfpathcurveto{\pgfqpoint{1.331cm}{1.3cm}}{\pgfqpoint{1.345cm}{1.335cm}}{\pgfqpoint{1.345cm}{1.371cm}}
\pgfusepath{fill}
\begin{pgfscope}
\pgfsetdash{}{0cm}
\pgfsetlinewidth{0.818mm}
\pgfsetroundcap
\pgfsetmiterlimit{4.0}
\pgfpathmoveto{\pgfqpoint{0.682cm}{0.671cm}}
\pgfpathlineto{\pgfqpoint{0.682cm}{0.042cm}}
\pgfusepath{stroke}
\end{pgfscope}
\end{pgfscope}
\end{pgfscope}
\end{pgfscope}
\end{tikzpicture}}} = \llbracket X^3 \rrbracket, \quad X^{\!\resizebox{0.6em}{!}{
\begin{tikzpicture}
\pgfpathmoveto{\pgfqpoint{0cm}{-0.035cm}}
\pgfpathlineto{\pgfqpoint{1.376cm}{-0.035cm}}
\pgfpathlineto{\pgfqpoint{1.376cm}{1.552cm}}
\pgfpathlineto{\pgfqpoint{0cm}{1.552cm}}
\pgfpathclose
\pgfusepath{clip}
\begin{pgfscope}
\begin{pgfscope}
\pgfpathmoveto{\pgfqpoint{0cm}{-0.035cm}}
\pgfpathlineto{\pgfqpoint{1.376cm}{-0.035cm}}
\pgfpathlineto{\pgfqpoint{1.376cm}{1.552cm}}
\pgfpathlineto{\pgfqpoint{0cm}{1.552cm}}
\pgfpathclose
\pgfusepath{clip}
\begin{pgfscope}
\begin{pgfscope}
\pgfsetdash{}{0cm}
\pgfsetlinewidth{0.818mm}
\pgfsetroundcap
\pgfsetroundjoin
\pgfsetmiterlimit{7.0}
\definecolor{eps2pgf_color}{gray}{0}\pgfsetstrokecolor{eps2pgf_color}\pgfsetfillcolor{eps2pgf_color}
\pgfpathmoveto{\pgfqpoint{0.117cm}{1.421cm}}
\pgfpathlineto{\pgfqpoint{0.682cm}{0.671cm}}
\pgfpathlineto{\pgfqpoint{1.246cm}{1.421cm}}
\pgfusepath{stroke}
\end{pgfscope}
\definecolor{eps2pgf_color}{gray}{0}\pgfsetstrokecolor{eps2pgf_color}\pgfsetfillcolor{eps2pgf_color}
\pgfpathmoveto{\pgfqpoint{0.273cm}{1.395cm}}
\pgfpathcurveto{\pgfqpoint{0.273cm}{1.432cm}}{\pgfqpoint{0.259cm}{1.467cm}}{\pgfqpoint{0.233cm}{1.492cm}}
\pgfpathcurveto{\pgfqpoint{0.207cm}{1.518cm}}{\pgfqpoint{0.173cm}{1.532cm}}{\pgfqpoint{0.137cm}{1.532cm}}
\pgfpathcurveto{\pgfqpoint{0.1cm}{1.532cm}}{\pgfqpoint{0.066cm}{1.518cm}}{\pgfqpoint{0.04cm}{1.492cm}}
\pgfpathcurveto{\pgfqpoint{0.014cm}{1.467cm}}{\pgfqpoint{0cm}{1.432cm}}{\pgfqpoint{0cm}{1.395cm}}
\pgfpathcurveto{\pgfqpoint{0cm}{1.359cm}}{\pgfqpoint{0.014cm}{1.324cm}}{\pgfqpoint{0.04cm}{1.299cm}}
\pgfpathcurveto{\pgfqpoint{0.066cm}{1.273cm}}{\pgfqpoint{0.1cm}{1.258cm}}{\pgfqpoint{0.137cm}{1.258cm}}
\pgfpathcurveto{\pgfqpoint{0.173cm}{1.258cm}}{\pgfqpoint{0.207cm}{1.273cm}}{\pgfqpoint{0.233cm}{1.299cm}}
\pgfpathcurveto{\pgfqpoint{0.259cm}{1.324cm}}{\pgfqpoint{0.273cm}{1.359cm}}{\pgfqpoint{0.273cm}{1.395cm}}
\pgfusepath{fill}
\begin{pgfscope}
\pgfsetdash{}{0cm}
\pgfsetlinewidth{0.818mm}
\pgfsetmiterlimit{7.0}
\pgfpathmoveto{\pgfqpoint{0.682cm}{0.671cm}}
\pgfpathlineto{\pgfqpoint{0.679cm}{1.418cm}}
\pgfusepath{stroke}
\end{pgfscope}
\pgfpathmoveto{\pgfqpoint{0.815cm}{1.399cm}}
\pgfpathcurveto{\pgfqpoint{0.815cm}{1.435cm}}{\pgfqpoint{0.801cm}{1.47cm}}{\pgfqpoint{0.775cm}{1.496cm}}
\pgfpathcurveto{\pgfqpoint{0.75cm}{1.521cm}}{\pgfqpoint{0.715cm}{1.536cm}}{\pgfqpoint{0.679cm}{1.536cm}}
\pgfpathcurveto{\pgfqpoint{0.643cm}{1.536cm}}{\pgfqpoint{0.608cm}{1.521cm}}{\pgfqpoint{0.582cm}{1.496cm}}
\pgfpathcurveto{\pgfqpoint{0.557cm}{1.47cm}}{\pgfqpoint{0.542cm}{1.435cm}}{\pgfqpoint{0.542cm}{1.399cm}}
\pgfpathcurveto{\pgfqpoint{0.542cm}{1.363cm}}{\pgfqpoint{0.557cm}{1.328cm}}{\pgfqpoint{0.582cm}{1.302cm}}
\pgfpathcurveto{\pgfqpoint{0.608cm}{1.276cm}}{\pgfqpoint{0.643cm}{1.262cm}}{\pgfqpoint{0.679cm}{1.262cm}}
\pgfpathcurveto{\pgfqpoint{0.715cm}{1.262cm}}{\pgfqpoint{0.75cm}{1.276cm}}{\pgfqpoint{0.775cm}{1.302cm}}
\pgfpathcurveto{\pgfqpoint{0.801cm}{1.328cm}}{\pgfqpoint{0.815cm}{1.363cm}}{\pgfqpoint{0.815cm}{1.399cm}}
\pgfusepath{fill}
\pgfpathmoveto{\pgfqpoint{1.345cm}{1.371cm}}
\pgfpathcurveto{\pgfqpoint{1.345cm}{1.408cm}}{\pgfqpoint{1.331cm}{1.442cm}}{\pgfqpoint{1.305cm}{1.468cm}}
\pgfpathcurveto{\pgfqpoint{1.28cm}{1.494cm}}{\pgfqpoint{1.245cm}{1.508cm}}{\pgfqpoint{1.209cm}{1.508cm}}
\pgfpathcurveto{\pgfqpoint{1.172cm}{1.508cm}}{\pgfqpoint{1.138cm}{1.494cm}}{\pgfqpoint{1.112cm}{1.468cm}}
\pgfpathcurveto{\pgfqpoint{1.087cm}{1.442cm}}{\pgfqpoint{1.072cm}{1.408cm}}{\pgfqpoint{1.072cm}{1.371cm}}
\pgfpathcurveto{\pgfqpoint{1.072cm}{1.335cm}}{\pgfqpoint{1.087cm}{1.3cm}}{\pgfqpoint{1.112cm}{1.274cm}}
\pgfpathcurveto{\pgfqpoint{1.138cm}{1.249cm}}{\pgfqpoint{1.172cm}{1.234cm}}{\pgfqpoint{1.209cm}{1.234cm}}
\pgfpathcurveto{\pgfqpoint{1.245cm}{1.234cm}}{\pgfqpoint{1.28cm}{1.249cm}}{\pgfqpoint{1.305cm}{1.274cm}}
\pgfpathcurveto{\pgfqpoint{1.331cm}{1.3cm}}{\pgfqpoint{1.345cm}{1.335cm}}{\pgfqpoint{1.345cm}{1.371cm}}
\pgfusepath{fill}
\begin{pgfscope}
\pgfsetdash{}{0cm}
\pgfsetlinewidth{0.818mm}
\pgfsetroundcap
\pgfsetmiterlimit{4.0}
\pgfpathmoveto{\pgfqpoint{0.682cm}{0.671cm}}
\pgfpathlineto{\pgfqpoint{0.682cm}{0.042cm}}
\pgfusepath{stroke}
\end{pgfscope}
\end{pgfscope}
\end{pgfscope}
\end{pgfscope}
\end{tikzpicture}}}(0)=0,\qquad\LL
   X^{\!\resizebox{0.6em}{!}{
\begin{tikzpicture}
\pgfpathmoveto{\pgfqpoint{0cm}{0cm}}
\pgfpathlineto{\pgfqpoint{1.376cm}{0cm}}
\pgfpathlineto{\pgfqpoint{1.376cm}{1.588cm}}
\pgfpathlineto{\pgfqpoint{0cm}{1.588cm}}
\pgfpathclose
\pgfusepath{clip}
\begin{pgfscope}
\begin{pgfscope}
\pgfpathmoveto{\pgfqpoint{0cm}{0cm}}
\pgfpathlineto{\pgfqpoint{1.376cm}{0cm}}
\pgfpathlineto{\pgfqpoint{1.376cm}{1.588cm}}
\pgfpathlineto{\pgfqpoint{0cm}{1.588cm}}
\pgfpathclose
\pgfusepath{clip}
\begin{pgfscope}
\begin{pgfscope}
\definecolor{eps2pgf_color}{gray}{0.976471}\pgfsetstrokecolor{eps2pgf_color}\pgfsetfillcolor{eps2pgf_color}
\pgfpathmoveto{\pgfqpoint{0cm}{0cm}}
\pgfpathlineto{\pgfqpoint{1.376cm}{0cm}}
\pgfpathlineto{\pgfqpoint{1.376cm}{1.588cm}}
\pgfpathlineto{\pgfqpoint{0cm}{1.588cm}}
\pgfpathclose
\pgfusepath{fill}
\end{pgfscope}
\begin{pgfscope}
\pgfsetdash{}{0cm}
\pgfsetlinewidth{0.818mm}
\pgfsetroundcap
\pgfsetroundjoin
\pgfsetmiterlimit{7.0}
\definecolor{eps2pgf_color}{gray}{0}\pgfsetstrokecolor{eps2pgf_color}\pgfsetfillcolor{eps2pgf_color}
\pgfpathmoveto{\pgfqpoint{0.117cm}{1.476cm}}
\pgfpathlineto{\pgfqpoint{0.682cm}{0.726cm}}
\pgfpathlineto{\pgfqpoint{1.246cm}{1.476cm}}
\pgfusepath{stroke}
\end{pgfscope}
\definecolor{eps2pgf_color}{gray}{0}\pgfsetstrokecolor{eps2pgf_color}\pgfsetfillcolor{eps2pgf_color}
\pgfpathmoveto{\pgfqpoint{0.273cm}{1.451cm}}
\pgfpathcurveto{\pgfqpoint{0.273cm}{1.487cm}}{\pgfqpoint{0.259cm}{1.522cm}}{\pgfqpoint{0.233cm}{1.547cm}}
\pgfpathcurveto{\pgfqpoint{0.207cm}{1.573cm}}{\pgfqpoint{0.173cm}{1.588cm}}{\pgfqpoint{0.137cm}{1.588cm}}
\pgfpathcurveto{\pgfqpoint{0.1cm}{1.588cm}}{\pgfqpoint{0.066cm}{1.573cm}}{\pgfqpoint{0.04cm}{1.547cm}}
\pgfpathcurveto{\pgfqpoint{0.014cm}{1.522cm}}{\pgfqpoint{0cm}{1.487cm}}{\pgfqpoint{0cm}{1.451cm}}
\pgfpathcurveto{\pgfqpoint{0cm}{1.414cm}}{\pgfqpoint{0.014cm}{1.379cm}}{\pgfqpoint{0.04cm}{1.354cm}}
\pgfpathcurveto{\pgfqpoint{0.066cm}{1.328cm}}{\pgfqpoint{0.1cm}{1.314cm}}{\pgfqpoint{0.137cm}{1.314cm}}
\pgfpathcurveto{\pgfqpoint{0.173cm}{1.314cm}}{\pgfqpoint{0.207cm}{1.328cm}}{\pgfqpoint{0.233cm}{1.354cm}}
\pgfpathcurveto{\pgfqpoint{0.259cm}{1.379cm}}{\pgfqpoint{0.273cm}{1.414cm}}{\pgfqpoint{0.273cm}{1.451cm}}
\pgfusepath{fill}
\pgfpathmoveto{\pgfqpoint{1.345cm}{1.426cm}}
\pgfpathcurveto{\pgfqpoint{1.345cm}{1.463cm}}{\pgfqpoint{1.331cm}{1.497cm}}{\pgfqpoint{1.305cm}{1.523cm}}
\pgfpathcurveto{\pgfqpoint{1.28cm}{1.549cm}}{\pgfqpoint{1.245cm}{1.563cm}}{\pgfqpoint{1.209cm}{1.563cm}}
\pgfpathcurveto{\pgfqpoint{1.172cm}{1.563cm}}{\pgfqpoint{1.138cm}{1.549cm}}{\pgfqpoint{1.112cm}{1.523cm}}
\pgfpathcurveto{\pgfqpoint{1.087cm}{1.497cm}}{\pgfqpoint{1.072cm}{1.463cm}}{\pgfqpoint{1.072cm}{1.426cm}}
\pgfpathcurveto{\pgfqpoint{1.072cm}{1.39cm}}{\pgfqpoint{1.087cm}{1.355cm}}{\pgfqpoint{1.112cm}{1.329cm}}
\pgfpathcurveto{\pgfqpoint{1.138cm}{1.304cm}}{\pgfqpoint{1.172cm}{1.289cm}}{\pgfqpoint{1.209cm}{1.289cm}}
\pgfpathcurveto{\pgfqpoint{1.245cm}{1.289cm}}{\pgfqpoint{1.28cm}{1.304cm}}{\pgfqpoint{1.305cm}{1.329cm}}
\pgfpathcurveto{\pgfqpoint{1.331cm}{1.355cm}}{\pgfqpoint{1.345cm}{1.39cm}}{\pgfqpoint{1.345cm}{1.426cm}}
\pgfusepath{fill}
\begin{pgfscope}
\pgfsetdash{}{0cm}
\pgfsetlinewidth{0.818mm}
\pgfsetroundcap
\pgfsetmiterlimit{4.0}
\pgfpathmoveto{\pgfqpoint{0.682cm}{0.726cm}}
\pgfpathlineto{\pgfqpoint{0.682cm}{0.097cm}}
\pgfusepath{stroke}
\end{pgfscope}
\end{pgfscope}
\end{pgfscope}
\end{pgfscope}
\end{tikzpicture}}} = \llbracket X^2 \rrbracket,\quadX^{\!\resizebox{0.6em}{!}{
\begin{tikzpicture}
\pgfpathmoveto{\pgfqpoint{0cm}{0cm}}
\pgfpathlineto{\pgfqpoint{1.376cm}{0cm}}
\pgfpathlineto{\pgfqpoint{1.376cm}{1.588cm}}
\pgfpathlineto{\pgfqpoint{0cm}{1.588cm}}
\pgfpathclose
\pgfusepath{clip}
\begin{pgfscope}
\begin{pgfscope}
\pgfpathmoveto{\pgfqpoint{0cm}{0cm}}
\pgfpathlineto{\pgfqpoint{1.376cm}{0cm}}
\pgfpathlineto{\pgfqpoint{1.376cm}{1.588cm}}
\pgfpathlineto{\pgfqpoint{0cm}{1.588cm}}
\pgfpathclose
\pgfusepath{clip}
\begin{pgfscope}
\begin{pgfscope}
\definecolor{eps2pgf_color}{gray}{0.976471}\pgfsetstrokecolor{eps2pgf_color}\pgfsetfillcolor{eps2pgf_color}
\pgfpathmoveto{\pgfqpoint{0cm}{0cm}}
\pgfpathlineto{\pgfqpoint{1.376cm}{0cm}}
\pgfpathlineto{\pgfqpoint{1.376cm}{1.588cm}}
\pgfpathlineto{\pgfqpoint{0cm}{1.588cm}}
\pgfpathclose
\pgfusepath{fill}
\end{pgfscope}
\begin{pgfscope}
\pgfsetdash{}{0cm}
\pgfsetlinewidth{0.818mm}
\pgfsetroundcap
\pgfsetroundjoin
\pgfsetmiterlimit{7.0}
\definecolor{eps2pgf_color}{gray}{0}\pgfsetstrokecolor{eps2pgf_color}\pgfsetfillcolor{eps2pgf_color}
\pgfpathmoveto{\pgfqpoint{0.117cm}{1.476cm}}
\pgfpathlineto{\pgfqpoint{0.682cm}{0.726cm}}
\pgfpathlineto{\pgfqpoint{1.246cm}{1.476cm}}
\pgfusepath{stroke}
\end{pgfscope}
\definecolor{eps2pgf_color}{gray}{0}\pgfsetstrokecolor{eps2pgf_color}\pgfsetfillcolor{eps2pgf_color}
\pgfpathmoveto{\pgfqpoint{0.273cm}{1.451cm}}
\pgfpathcurveto{\pgfqpoint{0.273cm}{1.487cm}}{\pgfqpoint{0.259cm}{1.522cm}}{\pgfqpoint{0.233cm}{1.547cm}}
\pgfpathcurveto{\pgfqpoint{0.207cm}{1.573cm}}{\pgfqpoint{0.173cm}{1.588cm}}{\pgfqpoint{0.137cm}{1.588cm}}
\pgfpathcurveto{\pgfqpoint{0.1cm}{1.588cm}}{\pgfqpoint{0.066cm}{1.573cm}}{\pgfqpoint{0.04cm}{1.547cm}}
\pgfpathcurveto{\pgfqpoint{0.014cm}{1.522cm}}{\pgfqpoint{0cm}{1.487cm}}{\pgfqpoint{0cm}{1.451cm}}
\pgfpathcurveto{\pgfqpoint{0cm}{1.414cm}}{\pgfqpoint{0.014cm}{1.379cm}}{\pgfqpoint{0.04cm}{1.354cm}}
\pgfpathcurveto{\pgfqpoint{0.066cm}{1.328cm}}{\pgfqpoint{0.1cm}{1.314cm}}{\pgfqpoint{0.137cm}{1.314cm}}
\pgfpathcurveto{\pgfqpoint{0.173cm}{1.314cm}}{\pgfqpoint{0.207cm}{1.328cm}}{\pgfqpoint{0.233cm}{1.354cm}}
\pgfpathcurveto{\pgfqpoint{0.259cm}{1.379cm}}{\pgfqpoint{0.273cm}{1.414cm}}{\pgfqpoint{0.273cm}{1.451cm}}
\pgfusepath{fill}
\pgfpathmoveto{\pgfqpoint{1.345cm}{1.426cm}}
\pgfpathcurveto{\pgfqpoint{1.345cm}{1.463cm}}{\pgfqpoint{1.331cm}{1.497cm}}{\pgfqpoint{1.305cm}{1.523cm}}
\pgfpathcurveto{\pgfqpoint{1.28cm}{1.549cm}}{\pgfqpoint{1.245cm}{1.563cm}}{\pgfqpoint{1.209cm}{1.563cm}}
\pgfpathcurveto{\pgfqpoint{1.172cm}{1.563cm}}{\pgfqpoint{1.138cm}{1.549cm}}{\pgfqpoint{1.112cm}{1.523cm}}
\pgfpathcurveto{\pgfqpoint{1.087cm}{1.497cm}}{\pgfqpoint{1.072cm}{1.463cm}}{\pgfqpoint{1.072cm}{1.426cm}}
\pgfpathcurveto{\pgfqpoint{1.072cm}{1.39cm}}{\pgfqpoint{1.087cm}{1.355cm}}{\pgfqpoint{1.112cm}{1.329cm}}
\pgfpathcurveto{\pgfqpoint{1.138cm}{1.304cm}}{\pgfqpoint{1.172cm}{1.289cm}}{\pgfqpoint{1.209cm}{1.289cm}}
\pgfpathcurveto{\pgfqpoint{1.245cm}{1.289cm}}{\pgfqpoint{1.28cm}{1.304cm}}{\pgfqpoint{1.305cm}{1.329cm}}
\pgfpathcurveto{\pgfqpoint{1.331cm}{1.355cm}}{\pgfqpoint{1.345cm}{1.39cm}}{\pgfqpoint{1.345cm}{1.426cm}}
\pgfusepath{fill}
\begin{pgfscope}
\pgfsetdash{}{0cm}
\pgfsetlinewidth{0.818mm}
\pgfsetroundcap
\pgfsetmiterlimit{4.0}
\pgfpathmoveto{\pgfqpoint{0.682cm}{0.726cm}}
\pgfpathlineto{\pgfqpoint{0.682cm}{0.097cm}}
\pgfusepath{stroke}
\end{pgfscope}
\end{pgfscope}
\end{pgfscope}
\end{pgfscope}
\end{tikzpicture}}}(0)=0, \]
\[X^{\!\resizebox{!}{.8em}{
\begin{tikzpicture}
\pgfpathmoveto{\pgfqpoint{0cm}{-0.035cm}}
\pgfpathlineto{\pgfqpoint{1.976cm}{-0.035cm}}
\pgfpathlineto{\pgfqpoint{1.976cm}{1.94cm}}
\pgfpathlineto{\pgfqpoint{0cm}{1.94cm}}
\pgfpathclose
\pgfusepath{clip}
\begin{pgfscope}
\begin{pgfscope}
\pgfpathmoveto{\pgfqpoint{0cm}{-0.035cm}}
\pgfpathlineto{\pgfqpoint{1.976cm}{-0.035cm}}
\pgfpathlineto{\pgfqpoint{1.976cm}{1.94cm}}
\pgfpathlineto{\pgfqpoint{0cm}{1.94cm}}
\pgfpathclose
\pgfusepath{clip}
\begin{pgfscope}
\begin{pgfscope}
\pgfsetdash{}{0cm}
\pgfsetlinewidth{0.818mm}
\pgfsetroundcap
\pgfsetroundjoin
\pgfsetmiterlimit{7.0}
\definecolor{eps2pgf_color}{gray}{0}\pgfsetstrokecolor{eps2pgf_color}\pgfsetfillcolor{eps2pgf_color}
\pgfpathmoveto{\pgfqpoint{0.117cm}{1.815cm}}
\pgfpathlineto{\pgfqpoint{0.682cm}{1.065cm}}
\pgfpathlineto{\pgfqpoint{1.246cm}{1.815cm}}
\pgfusepath{stroke}
\end{pgfscope}
\definecolor{eps2pgf_color}{gray}{0}\pgfsetstrokecolor{eps2pgf_color}\pgfsetfillcolor{eps2pgf_color}
\pgfpathmoveto{\pgfqpoint{0.273cm}{1.789cm}}
\pgfpathcurveto{\pgfqpoint{0.273cm}{1.825cm}}{\pgfqpoint{0.259cm}{1.86cm}}{\pgfqpoint{0.233cm}{1.886cm}}
\pgfpathcurveto{\pgfqpoint{0.207cm}{1.912cm}}{\pgfqpoint{0.173cm}{1.926cm}}{\pgfqpoint{0.137cm}{1.926cm}}
\pgfpathcurveto{\pgfqpoint{0.1cm}{1.926cm}}{\pgfqpoint{0.066cm}{1.912cm}}{\pgfqpoint{0.04cm}{1.886cm}}
\pgfpathcurveto{\pgfqpoint{0.014cm}{1.86cm}}{\pgfqpoint{0cm}{1.825cm}}{\pgfqpoint{0cm}{1.789cm}}
\pgfpathcurveto{\pgfqpoint{0cm}{1.753cm}}{\pgfqpoint{0.014cm}{1.718cm}}{\pgfqpoint{0.04cm}{1.692cm}}
\pgfpathcurveto{\pgfqpoint{0.066cm}{1.667cm}}{\pgfqpoint{0.1cm}{1.652cm}}{\pgfqpoint{0.137cm}{1.652cm}}
\pgfpathcurveto{\pgfqpoint{0.173cm}{1.652cm}}{\pgfqpoint{0.207cm}{1.667cm}}{\pgfqpoint{0.233cm}{1.692cm}}
\pgfpathcurveto{\pgfqpoint{0.259cm}{1.718cm}}{\pgfqpoint{0.273cm}{1.753cm}}{\pgfqpoint{0.273cm}{1.789cm}}
\pgfusepath{fill}
\begin{pgfscope}
\pgfsetdash{}{0cm}
\pgfsetlinewidth{0.818mm}
\pgfsetmiterlimit{7.0}
\pgfpathmoveto{\pgfqpoint{0.682cm}{1.065cm}}
\pgfpathlineto{\pgfqpoint{0.679cm}{1.812cm}}
\pgfusepath{stroke}
\end{pgfscope}
\pgfpathmoveto{\pgfqpoint{0.815cm}{1.793cm}}
\pgfpathcurveto{\pgfqpoint{0.815cm}{1.829cm}}{\pgfqpoint{0.801cm}{1.864cm}}{\pgfqpoint{0.775cm}{1.89cm}}
\pgfpathcurveto{\pgfqpoint{0.75cm}{1.915cm}}{\pgfqpoint{0.715cm}{1.93cm}}{\pgfqpoint{0.679cm}{1.93cm}}
\pgfpathcurveto{\pgfqpoint{0.643cm}{1.93cm}}{\pgfqpoint{0.608cm}{1.915cm}}{\pgfqpoint{0.582cm}{1.89cm}}
\pgfpathcurveto{\pgfqpoint{0.557cm}{1.864cm}}{\pgfqpoint{0.542cm}{1.829cm}}{\pgfqpoint{0.542cm}{1.793cm}}
\pgfpathcurveto{\pgfqpoint{0.542cm}{1.756cm}}{\pgfqpoint{0.557cm}{1.722cm}}{\pgfqpoint{0.582cm}{1.696cm}}
\pgfpathcurveto{\pgfqpoint{0.608cm}{1.67cm}}{\pgfqpoint{0.643cm}{1.656cm}}{\pgfqpoint{0.679cm}{1.656cm}}
\pgfpathcurveto{\pgfqpoint{0.715cm}{1.656cm}}{\pgfqpoint{0.75cm}{1.67cm}}{\pgfqpoint{0.775cm}{1.696cm}}
\pgfpathcurveto{\pgfqpoint{0.801cm}{1.722cm}}{\pgfqpoint{0.815cm}{1.756cm}}{\pgfqpoint{0.815cm}{1.793cm}}
\pgfusepath{fill}
\pgfpathmoveto{\pgfqpoint{1.345cm}{1.765cm}}
\pgfpathcurveto{\pgfqpoint{1.345cm}{1.801cm}}{\pgfqpoint{1.331cm}{1.836cm}}{\pgfqpoint{1.305cm}{1.862cm}}
\pgfpathcurveto{\pgfqpoint{1.28cm}{1.887cm}}{\pgfqpoint{1.245cm}{1.902cm}}{\pgfqpoint{1.209cm}{1.902cm}}
\pgfpathcurveto{\pgfqpoint{1.172cm}{1.902cm}}{\pgfqpoint{1.138cm}{1.887cm}}{\pgfqpoint{1.112cm}{1.862cm}}
\pgfpathcurveto{\pgfqpoint{1.087cm}{1.836cm}}{\pgfqpoint{1.072cm}{1.801cm}}{\pgfqpoint{1.072cm}{1.765cm}}
\pgfpathcurveto{\pgfqpoint{1.072cm}{1.728cm}}{\pgfqpoint{1.087cm}{1.694cm}}{\pgfqpoint{1.112cm}{1.668cm}}
\pgfpathcurveto{\pgfqpoint{1.138cm}{1.642cm}}{\pgfqpoint{1.172cm}{1.628cm}}{\pgfqpoint{1.209cm}{1.628cm}}
\pgfpathcurveto{\pgfqpoint{1.245cm}{1.628cm}}{\pgfqpoint{1.28cm}{1.642cm}}{\pgfqpoint{1.305cm}{1.668cm}}
\pgfpathcurveto{\pgfqpoint{1.331cm}{1.694cm}}{\pgfqpoint{1.345cm}{1.728cm}}{\pgfqpoint{1.345cm}{1.765cm}}
\pgfusepath{fill}
\begin{pgfscope}
\pgfsetdash{}{0cm}
\pgfsetlinewidth{0.818mm}
\pgfsetroundcap
\pgfsetroundjoin
\pgfsetmiterlimit{7.0}
\pgfpathmoveto{\pgfqpoint{0.682cm}{1.065cm}}
\pgfpathlineto{\pgfqpoint{1.246cm}{0.315cm}}
\pgfpathlineto{\pgfqpoint{1.811cm}{1.065cm}}
\pgfusepath{stroke}
\end{pgfscope}
\pgfpathmoveto{\pgfqpoint{1.948cm}{1.065cm}}
\pgfpathcurveto{\pgfqpoint{1.948cm}{1.101cm}}{\pgfqpoint{1.933cm}{1.136cm}}{\pgfqpoint{1.907cm}{1.162cm}}
\pgfpathcurveto{\pgfqpoint{1.882cm}{1.187cm}}{\pgfqpoint{1.847cm}{1.202cm}}{\pgfqpoint{1.811cm}{1.202cm}}
\pgfpathcurveto{\pgfqpoint{1.775cm}{1.202cm}}{\pgfqpoint{1.74cm}{1.187cm}}{\pgfqpoint{1.714cm}{1.162cm}}
\pgfpathcurveto{\pgfqpoint{1.689cm}{1.136cm}}{\pgfqpoint{1.674cm}{1.101cm}}{\pgfqpoint{1.674cm}{1.065cm}}
\pgfpathcurveto{\pgfqpoint{1.674cm}{1.029cm}}{\pgfqpoint{1.689cm}{0.994cm}}{\pgfqpoint{1.714cm}{0.968cm}}
\pgfpathcurveto{\pgfqpoint{1.74cm}{0.942cm}}{\pgfqpoint{1.775cm}{0.928cm}}{\pgfqpoint{1.811cm}{0.928cm}}
\pgfpathcurveto{\pgfqpoint{1.847cm}{0.928cm}}{\pgfqpoint{1.882cm}{0.942cm}}{\pgfqpoint{1.907cm}{0.968cm}}
\pgfpathcurveto{\pgfqpoint{1.933cm}{0.994cm}}{\pgfqpoint{1.948cm}{1.029cm}}{\pgfqpoint{1.948cm}{1.065cm}}
\pgfusepath{fill}
\begin{pgfscope}
\pgfsetdash{}{0cm}
\pgfsetlinewidth{0.818mm}
\pgfsetmiterlimit{4.0}
\pgfpathmoveto{\pgfqpoint{1.383cm}{0.178cm}}
\pgfpathcurveto{\pgfqpoint{1.383cm}{0.214cm}}{\pgfqpoint{1.369cm}{0.249cm}}{\pgfqpoint{1.343cm}{0.275cm}}
\pgfpathcurveto{\pgfqpoint{1.317cm}{0.3cm}}{\pgfqpoint{1.283cm}{0.315cm}}{\pgfqpoint{1.246cm}{0.315cm}}
\pgfpathcurveto{\pgfqpoint{1.21cm}{0.315cm}}{\pgfqpoint{1.175cm}{0.3cm}}{\pgfqpoint{1.15cm}{0.275cm}}
\pgfpathcurveto{\pgfqpoint{1.124cm}{0.249cm}}{\pgfqpoint{1.11cm}{0.214cm}}{\pgfqpoint{1.11cm}{0.178cm}}
\pgfpathcurveto{\pgfqpoint{1.11cm}{0.141cm}}{\pgfqpoint{1.124cm}{0.107cm}}{\pgfqpoint{1.15cm}{0.081cm}}
\pgfpathcurveto{\pgfqpoint{1.175cm}{0.055cm}}{\pgfqpoint{1.21cm}{0.041cm}}{\pgfqpoint{1.246cm}{0.041cm}}
\pgfpathcurveto{\pgfqpoint{1.283cm}{0.041cm}}{\pgfqpoint{1.317cm}{0.055cm}}{\pgfqpoint{1.343cm}{0.081cm}}
\pgfpathcurveto{\pgfqpoint{1.369cm}{0.107cm}}{\pgfqpoint{1.383cm}{0.141cm}}{\pgfqpoint{1.383cm}{0.178cm}}
\pgfusepath{stroke}
\end{pgfscope}
\end{pgfscope}
\end{pgfscope}
\end{pgfscope}
\end{tikzpicture}}}=X^{\!\resizebox{0.6em}{!}{
\begin{tikzpicture}
\pgfpathmoveto{\pgfqpoint{0cm}{-0.035cm}}
\pgfpathlineto{\pgfqpoint{1.376cm}{-0.035cm}}
\pgfpathlineto{\pgfqpoint{1.376cm}{1.552cm}}
\pgfpathlineto{\pgfqpoint{0cm}{1.552cm}}
\pgfpathclose
\pgfusepath{clip}
\begin{pgfscope}
\begin{pgfscope}
\pgfpathmoveto{\pgfqpoint{0cm}{-0.035cm}}
\pgfpathlineto{\pgfqpoint{1.376cm}{-0.035cm}}
\pgfpathlineto{\pgfqpoint{1.376cm}{1.552cm}}
\pgfpathlineto{\pgfqpoint{0cm}{1.552cm}}
\pgfpathclose
\pgfusepath{clip}
\begin{pgfscope}
\begin{pgfscope}
\pgfsetdash{}{0cm}
\pgfsetlinewidth{0.818mm}
\pgfsetroundcap
\pgfsetroundjoin
\pgfsetmiterlimit{7.0}
\definecolor{eps2pgf_color}{gray}{0}\pgfsetstrokecolor{eps2pgf_color}\pgfsetfillcolor{eps2pgf_color}
\pgfpathmoveto{\pgfqpoint{0.117cm}{1.421cm}}
\pgfpathlineto{\pgfqpoint{0.682cm}{0.671cm}}
\pgfpathlineto{\pgfqpoint{1.246cm}{1.421cm}}
\pgfusepath{stroke}
\end{pgfscope}
\definecolor{eps2pgf_color}{gray}{0}\pgfsetstrokecolor{eps2pgf_color}\pgfsetfillcolor{eps2pgf_color}
\pgfpathmoveto{\pgfqpoint{0.273cm}{1.395cm}}
\pgfpathcurveto{\pgfqpoint{0.273cm}{1.432cm}}{\pgfqpoint{0.259cm}{1.467cm}}{\pgfqpoint{0.233cm}{1.492cm}}
\pgfpathcurveto{\pgfqpoint{0.207cm}{1.518cm}}{\pgfqpoint{0.173cm}{1.532cm}}{\pgfqpoint{0.137cm}{1.532cm}}
\pgfpathcurveto{\pgfqpoint{0.1cm}{1.532cm}}{\pgfqpoint{0.066cm}{1.518cm}}{\pgfqpoint{0.04cm}{1.492cm}}
\pgfpathcurveto{\pgfqpoint{0.014cm}{1.467cm}}{\pgfqpoint{0cm}{1.432cm}}{\pgfqpoint{0cm}{1.395cm}}
\pgfpathcurveto{\pgfqpoint{0cm}{1.359cm}}{\pgfqpoint{0.014cm}{1.324cm}}{\pgfqpoint{0.04cm}{1.299cm}}
\pgfpathcurveto{\pgfqpoint{0.066cm}{1.273cm}}{\pgfqpoint{0.1cm}{1.258cm}}{\pgfqpoint{0.137cm}{1.258cm}}
\pgfpathcurveto{\pgfqpoint{0.173cm}{1.258cm}}{\pgfqpoint{0.207cm}{1.273cm}}{\pgfqpoint{0.233cm}{1.299cm}}
\pgfpathcurveto{\pgfqpoint{0.259cm}{1.324cm}}{\pgfqpoint{0.273cm}{1.359cm}}{\pgfqpoint{0.273cm}{1.395cm}}
\pgfusepath{fill}
\begin{pgfscope}
\pgfsetdash{}{0cm}
\pgfsetlinewidth{0.818mm}
\pgfsetmiterlimit{7.0}
\pgfpathmoveto{\pgfqpoint{0.682cm}{0.671cm}}
\pgfpathlineto{\pgfqpoint{0.679cm}{1.418cm}}
\pgfusepath{stroke}
\end{pgfscope}
\pgfpathmoveto{\pgfqpoint{0.815cm}{1.399cm}}
\pgfpathcurveto{\pgfqpoint{0.815cm}{1.435cm}}{\pgfqpoint{0.801cm}{1.47cm}}{\pgfqpoint{0.775cm}{1.496cm}}
\pgfpathcurveto{\pgfqpoint{0.75cm}{1.521cm}}{\pgfqpoint{0.715cm}{1.536cm}}{\pgfqpoint{0.679cm}{1.536cm}}
\pgfpathcurveto{\pgfqpoint{0.643cm}{1.536cm}}{\pgfqpoint{0.608cm}{1.521cm}}{\pgfqpoint{0.582cm}{1.496cm}}
\pgfpathcurveto{\pgfqpoint{0.557cm}{1.47cm}}{\pgfqpoint{0.542cm}{1.435cm}}{\pgfqpoint{0.542cm}{1.399cm}}
\pgfpathcurveto{\pgfqpoint{0.542cm}{1.363cm}}{\pgfqpoint{0.557cm}{1.328cm}}{\pgfqpoint{0.582cm}{1.302cm}}
\pgfpathcurveto{\pgfqpoint{0.608cm}{1.276cm}}{\pgfqpoint{0.643cm}{1.262cm}}{\pgfqpoint{0.679cm}{1.262cm}}
\pgfpathcurveto{\pgfqpoint{0.715cm}{1.262cm}}{\pgfqpoint{0.75cm}{1.276cm}}{\pgfqpoint{0.775cm}{1.302cm}}
\pgfpathcurveto{\pgfqpoint{0.801cm}{1.328cm}}{\pgfqpoint{0.815cm}{1.363cm}}{\pgfqpoint{0.815cm}{1.399cm}}
\pgfusepath{fill}
\pgfpathmoveto{\pgfqpoint{1.345cm}{1.371cm}}
\pgfpathcurveto{\pgfqpoint{1.345cm}{1.408cm}}{\pgfqpoint{1.331cm}{1.442cm}}{\pgfqpoint{1.305cm}{1.468cm}}
\pgfpathcurveto{\pgfqpoint{1.28cm}{1.494cm}}{\pgfqpoint{1.245cm}{1.508cm}}{\pgfqpoint{1.209cm}{1.508cm}}
\pgfpathcurveto{\pgfqpoint{1.172cm}{1.508cm}}{\pgfqpoint{1.138cm}{1.494cm}}{\pgfqpoint{1.112cm}{1.468cm}}
\pgfpathcurveto{\pgfqpoint{1.087cm}{1.442cm}}{\pgfqpoint{1.072cm}{1.408cm}}{\pgfqpoint{1.072cm}{1.371cm}}
\pgfpathcurveto{\pgfqpoint{1.072cm}{1.335cm}}{\pgfqpoint{1.087cm}{1.3cm}}{\pgfqpoint{1.112cm}{1.274cm}}
\pgfpathcurveto{\pgfqpoint{1.138cm}{1.249cm}}{\pgfqpoint{1.172cm}{1.234cm}}{\pgfqpoint{1.209cm}{1.234cm}}
\pgfpathcurveto{\pgfqpoint{1.245cm}{1.234cm}}{\pgfqpoint{1.28cm}{1.249cm}}{\pgfqpoint{1.305cm}{1.274cm}}
\pgfpathcurveto{\pgfqpoint{1.331cm}{1.3cm}}{\pgfqpoint{1.345cm}{1.335cm}}{\pgfqpoint{1.345cm}{1.371cm}}
\pgfusepath{fill}
\begin{pgfscope}
\pgfsetdash{}{0cm}
\pgfsetlinewidth{0.818mm}
\pgfsetroundcap
\pgfsetmiterlimit{4.0}
\pgfpathmoveto{\pgfqpoint{0.682cm}{0.671cm}}
\pgfpathlineto{\pgfqpoint{0.682cm}{0.042cm}}
\pgfusepath{stroke}
\end{pgfscope}
\end{pgfscope}
\end{pgfscope}
\end{pgfscope}
\end{tikzpicture}}}\circ X,\qquad X^{\!\resizebox{!}{.8em}{
\begin{tikzpicture}
\pgfpathmoveto{\pgfqpoint{0cm}{-0.035cm}}
\pgfpathlineto{\pgfqpoint{1.976cm}{-0.035cm}}
\pgfpathlineto{\pgfqpoint{1.976cm}{1.94cm}}
\pgfpathlineto{\pgfqpoint{0cm}{1.94cm}}
\pgfpathclose
\pgfusepath{clip}
\begin{pgfscope}
\begin{pgfscope}
\pgfpathmoveto{\pgfqpoint{0cm}{-0.035cm}}
\pgfpathlineto{\pgfqpoint{1.976cm}{-0.035cm}}
\pgfpathlineto{\pgfqpoint{1.976cm}{1.94cm}}
\pgfpathlineto{\pgfqpoint{0cm}{1.94cm}}
\pgfpathclose
\pgfusepath{clip}
\begin{pgfscope}
\begin{pgfscope}
\pgfsetdash{}{0cm}
\pgfsetlinewidth{0.818mm}
\pgfsetroundcap
\pgfsetroundjoin
\pgfsetmiterlimit{7.0}
\definecolor{eps2pgf_color}{gray}{0}\pgfsetstrokecolor{eps2pgf_color}\pgfsetfillcolor{eps2pgf_color}
\pgfpathmoveto{\pgfqpoint{0.117cm}{1.815cm}}
\pgfpathlineto{\pgfqpoint{0.682cm}{1.065cm}}
\pgfpathlineto{\pgfqpoint{1.246cm}{1.815cm}}
\pgfusepath{stroke}
\end{pgfscope}
\definecolor{eps2pgf_color}{gray}{0}\pgfsetstrokecolor{eps2pgf_color}\pgfsetfillcolor{eps2pgf_color}
\pgfpathmoveto{\pgfqpoint{0.273cm}{1.789cm}}
\pgfpathcurveto{\pgfqpoint{0.273cm}{1.825cm}}{\pgfqpoint{0.259cm}{1.86cm}}{\pgfqpoint{0.233cm}{1.886cm}}
\pgfpathcurveto{\pgfqpoint{0.207cm}{1.912cm}}{\pgfqpoint{0.173cm}{1.926cm}}{\pgfqpoint{0.137cm}{1.926cm}}
\pgfpathcurveto{\pgfqpoint{0.1cm}{1.926cm}}{\pgfqpoint{0.066cm}{1.912cm}}{\pgfqpoint{0.04cm}{1.886cm}}
\pgfpathcurveto{\pgfqpoint{0.014cm}{1.86cm}}{\pgfqpoint{0cm}{1.825cm}}{\pgfqpoint{0cm}{1.789cm}}
\pgfpathcurveto{\pgfqpoint{0cm}{1.753cm}}{\pgfqpoint{0.014cm}{1.718cm}}{\pgfqpoint{0.04cm}{1.692cm}}
\pgfpathcurveto{\pgfqpoint{0.066cm}{1.667cm}}{\pgfqpoint{0.1cm}{1.652cm}}{\pgfqpoint{0.137cm}{1.652cm}}
\pgfpathcurveto{\pgfqpoint{0.173cm}{1.652cm}}{\pgfqpoint{0.207cm}{1.667cm}}{\pgfqpoint{0.233cm}{1.692cm}}
\pgfpathcurveto{\pgfqpoint{0.259cm}{1.718cm}}{\pgfqpoint{0.273cm}{1.753cm}}{\pgfqpoint{0.273cm}{1.789cm}}
\pgfusepath{fill}
\pgfpathmoveto{\pgfqpoint{1.345cm}{1.765cm}}
\pgfpathcurveto{\pgfqpoint{1.345cm}{1.801cm}}{\pgfqpoint{1.331cm}{1.836cm}}{\pgfqpoint{1.305cm}{1.862cm}}
\pgfpathcurveto{\pgfqpoint{1.28cm}{1.887cm}}{\pgfqpoint{1.245cm}{1.902cm}}{\pgfqpoint{1.209cm}{1.902cm}}
\pgfpathcurveto{\pgfqpoint{1.172cm}{1.902cm}}{\pgfqpoint{1.138cm}{1.887cm}}{\pgfqpoint{1.112cm}{1.862cm}}
\pgfpathcurveto{\pgfqpoint{1.087cm}{1.836cm}}{\pgfqpoint{1.072cm}{1.801cm}}{\pgfqpoint{1.072cm}{1.765cm}}
\pgfpathcurveto{\pgfqpoint{1.072cm}{1.728cm}}{\pgfqpoint{1.087cm}{1.694cm}}{\pgfqpoint{1.112cm}{1.668cm}}
\pgfpathcurveto{\pgfqpoint{1.138cm}{1.642cm}}{\pgfqpoint{1.172cm}{1.628cm}}{\pgfqpoint{1.209cm}{1.628cm}}
\pgfpathcurveto{\pgfqpoint{1.245cm}{1.628cm}}{\pgfqpoint{1.28cm}{1.642cm}}{\pgfqpoint{1.305cm}{1.668cm}}
\pgfpathcurveto{\pgfqpoint{1.331cm}{1.694cm}}{\pgfqpoint{1.345cm}{1.728cm}}{\pgfqpoint{1.345cm}{1.765cm}}
\pgfusepath{fill}
\begin{pgfscope}
\pgfsetdash{}{0cm}
\pgfsetlinewidth{0.818mm}
\pgfsetroundcap
\pgfsetroundjoin
\pgfsetmiterlimit{7.0}
\pgfpathmoveto{\pgfqpoint{0.682cm}{1.065cm}}
\pgfpathlineto{\pgfqpoint{1.246cm}{0.315cm}}
\pgfpathlineto{\pgfqpoint{1.811cm}{1.065cm}}
\pgfusepath{stroke}
\end{pgfscope}
\pgfpathmoveto{\pgfqpoint{1.948cm}{1.065cm}}
\pgfpathcurveto{\pgfqpoint{1.948cm}{1.101cm}}{\pgfqpoint{1.933cm}{1.136cm}}{\pgfqpoint{1.907cm}{1.162cm}}
\pgfpathcurveto{\pgfqpoint{1.882cm}{1.187cm}}{\pgfqpoint{1.847cm}{1.202cm}}{\pgfqpoint{1.811cm}{1.202cm}}
\pgfpathcurveto{\pgfqpoint{1.775cm}{1.202cm}}{\pgfqpoint{1.74cm}{1.187cm}}{\pgfqpoint{1.714cm}{1.162cm}}
\pgfpathcurveto{\pgfqpoint{1.689cm}{1.136cm}}{\pgfqpoint{1.674cm}{1.101cm}}{\pgfqpoint{1.674cm}{1.065cm}}
\pgfpathcurveto{\pgfqpoint{1.674cm}{1.029cm}}{\pgfqpoint{1.689cm}{0.994cm}}{\pgfqpoint{1.714cm}{0.968cm}}
\pgfpathcurveto{\pgfqpoint{1.74cm}{0.942cm}}{\pgfqpoint{1.775cm}{0.928cm}}{\pgfqpoint{1.811cm}{0.928cm}}
\pgfpathcurveto{\pgfqpoint{1.847cm}{0.928cm}}{\pgfqpoint{1.882cm}{0.942cm}}{\pgfqpoint{1.907cm}{0.968cm}}
\pgfpathcurveto{\pgfqpoint{1.933cm}{0.994cm}}{\pgfqpoint{1.948cm}{1.029cm}}{\pgfqpoint{1.948cm}{1.065cm}}
\pgfusepath{fill}
\begin{pgfscope}
\pgfsetdash{}{0cm}
\pgfsetlinewidth{0.818mm}
\pgfsetmiterlimit{7.0}
\pgfpathmoveto{\pgfqpoint{1.246cm}{0.315cm}}
\pgfpathlineto{\pgfqpoint{1.244cm}{1.061cm}}
\pgfusepath{stroke}
\end{pgfscope}
\pgfpathmoveto{\pgfqpoint{1.38cm}{1.065cm}}
\pgfpathcurveto{\pgfqpoint{1.38cm}{1.101cm}}{\pgfqpoint{1.366cm}{1.136cm}}{\pgfqpoint{1.34cm}{1.162cm}}
\pgfpathcurveto{\pgfqpoint{1.315cm}{1.187cm}}{\pgfqpoint{1.28cm}{1.202cm}}{\pgfqpoint{1.244cm}{1.202cm}}
\pgfpathcurveto{\pgfqpoint{1.207cm}{1.202cm}}{\pgfqpoint{1.173cm}{1.187cm}}{\pgfqpoint{1.147cm}{1.162cm}}
\pgfpathcurveto{\pgfqpoint{1.121cm}{1.136cm}}{\pgfqpoint{1.107cm}{1.101cm}}{\pgfqpoint{1.107cm}{1.065cm}}
\pgfpathcurveto{\pgfqpoint{1.107cm}{1.029cm}}{\pgfqpoint{1.121cm}{0.994cm}}{\pgfqpoint{1.147cm}{0.968cm}}
\pgfpathcurveto{\pgfqpoint{1.173cm}{0.942cm}}{\pgfqpoint{1.207cm}{0.928cm}}{\pgfqpoint{1.244cm}{0.928cm}}
\pgfpathcurveto{\pgfqpoint{1.28cm}{0.928cm}}{\pgfqpoint{1.315cm}{0.942cm}}{\pgfqpoint{1.34cm}{0.968cm}}
\pgfpathcurveto{\pgfqpoint{1.366cm}{0.994cm}}{\pgfqpoint{1.38cm}{1.029cm}}{\pgfqpoint{1.38cm}{1.065cm}}
\pgfusepath{fill}
\begin{pgfscope}
\pgfsetdash{}{0cm}
\pgfsetlinewidth{0.818mm}
\pgfsetmiterlimit{4.0}
\pgfpathmoveto{\pgfqpoint{1.383cm}{0.178cm}}
\pgfpathcurveto{\pgfqpoint{1.383cm}{0.214cm}}{\pgfqpoint{1.369cm}{0.249cm}}{\pgfqpoint{1.343cm}{0.275cm}}
\pgfpathcurveto{\pgfqpoint{1.317cm}{0.3cm}}{\pgfqpoint{1.283cm}{0.315cm}}{\pgfqpoint{1.246cm}{0.315cm}}
\pgfpathcurveto{\pgfqpoint{1.21cm}{0.315cm}}{\pgfqpoint{1.175cm}{0.3cm}}{\pgfqpoint{1.15cm}{0.275cm}}
\pgfpathcurveto{\pgfqpoint{1.124cm}{0.249cm}}{\pgfqpoint{1.11cm}{0.214cm}}{\pgfqpoint{1.11cm}{0.178cm}}
\pgfpathcurveto{\pgfqpoint{1.11cm}{0.141cm}}{\pgfqpoint{1.124cm}{0.107cm}}{\pgfqpoint{1.15cm}{0.081cm}}
\pgfpathcurveto{\pgfqpoint{1.175cm}{0.055cm}}{\pgfqpoint{1.21cm}{0.041cm}}{\pgfqpoint{1.246cm}{0.041cm}}
\pgfpathcurveto{\pgfqpoint{1.283cm}{0.041cm}}{\pgfqpoint{1.317cm}{0.055cm}}{\pgfqpoint{1.343cm}{0.081cm}}
\pgfpathcurveto{\pgfqpoint{1.369cm}{0.107cm}}{\pgfqpoint{1.383cm}{0.141cm}}{\pgfqpoint{1.383cm}{0.178cm}}
\pgfusepath{stroke}
\end{pgfscope}
\end{pgfscope}
\end{pgfscope}
\end{pgfscope}
\end{tikzpicture}}} = X^{\!\resizebox{0.6em}{!}{
\begin{tikzpicture}
\pgfpathmoveto{\pgfqpoint{0cm}{0cm}}
\pgfpathlineto{\pgfqpoint{1.376cm}{0cm}}
\pgfpathlineto{\pgfqpoint{1.376cm}{1.588cm}}
\pgfpathlineto{\pgfqpoint{0cm}{1.588cm}}
\pgfpathclose
\pgfusepath{clip}
\begin{pgfscope}
\begin{pgfscope}
\pgfpathmoveto{\pgfqpoint{0cm}{0cm}}
\pgfpathlineto{\pgfqpoint{1.376cm}{0cm}}
\pgfpathlineto{\pgfqpoint{1.376cm}{1.588cm}}
\pgfpathlineto{\pgfqpoint{0cm}{1.588cm}}
\pgfpathclose
\pgfusepath{clip}
\begin{pgfscope}
\begin{pgfscope}
\definecolor{eps2pgf_color}{gray}{0.976471}\pgfsetstrokecolor{eps2pgf_color}\pgfsetfillcolor{eps2pgf_color}
\pgfpathmoveto{\pgfqpoint{0cm}{0cm}}
\pgfpathlineto{\pgfqpoint{1.376cm}{0cm}}
\pgfpathlineto{\pgfqpoint{1.376cm}{1.588cm}}
\pgfpathlineto{\pgfqpoint{0cm}{1.588cm}}
\pgfpathclose
\pgfusepath{fill}
\end{pgfscope}
\begin{pgfscope}
\pgfsetdash{}{0cm}
\pgfsetlinewidth{0.818mm}
\pgfsetroundcap
\pgfsetroundjoin
\pgfsetmiterlimit{7.0}
\definecolor{eps2pgf_color}{gray}{0}\pgfsetstrokecolor{eps2pgf_color}\pgfsetfillcolor{eps2pgf_color}
\pgfpathmoveto{\pgfqpoint{0.117cm}{1.476cm}}
\pgfpathlineto{\pgfqpoint{0.682cm}{0.726cm}}
\pgfpathlineto{\pgfqpoint{1.246cm}{1.476cm}}
\pgfusepath{stroke}
\end{pgfscope}
\definecolor{eps2pgf_color}{gray}{0}\pgfsetstrokecolor{eps2pgf_color}\pgfsetfillcolor{eps2pgf_color}
\pgfpathmoveto{\pgfqpoint{0.273cm}{1.451cm}}
\pgfpathcurveto{\pgfqpoint{0.273cm}{1.487cm}}{\pgfqpoint{0.259cm}{1.522cm}}{\pgfqpoint{0.233cm}{1.547cm}}
\pgfpathcurveto{\pgfqpoint{0.207cm}{1.573cm}}{\pgfqpoint{0.173cm}{1.588cm}}{\pgfqpoint{0.137cm}{1.588cm}}
\pgfpathcurveto{\pgfqpoint{0.1cm}{1.588cm}}{\pgfqpoint{0.066cm}{1.573cm}}{\pgfqpoint{0.04cm}{1.547cm}}
\pgfpathcurveto{\pgfqpoint{0.014cm}{1.522cm}}{\pgfqpoint{0cm}{1.487cm}}{\pgfqpoint{0cm}{1.451cm}}
\pgfpathcurveto{\pgfqpoint{0cm}{1.414cm}}{\pgfqpoint{0.014cm}{1.379cm}}{\pgfqpoint{0.04cm}{1.354cm}}
\pgfpathcurveto{\pgfqpoint{0.066cm}{1.328cm}}{\pgfqpoint{0.1cm}{1.314cm}}{\pgfqpoint{0.137cm}{1.314cm}}
\pgfpathcurveto{\pgfqpoint{0.173cm}{1.314cm}}{\pgfqpoint{0.207cm}{1.328cm}}{\pgfqpoint{0.233cm}{1.354cm}}
\pgfpathcurveto{\pgfqpoint{0.259cm}{1.379cm}}{\pgfqpoint{0.273cm}{1.414cm}}{\pgfqpoint{0.273cm}{1.451cm}}
\pgfusepath{fill}
\pgfpathmoveto{\pgfqpoint{1.345cm}{1.426cm}}
\pgfpathcurveto{\pgfqpoint{1.345cm}{1.463cm}}{\pgfqpoint{1.331cm}{1.497cm}}{\pgfqpoint{1.305cm}{1.523cm}}
\pgfpathcurveto{\pgfqpoint{1.28cm}{1.549cm}}{\pgfqpoint{1.245cm}{1.563cm}}{\pgfqpoint{1.209cm}{1.563cm}}
\pgfpathcurveto{\pgfqpoint{1.172cm}{1.563cm}}{\pgfqpoint{1.138cm}{1.549cm}}{\pgfqpoint{1.112cm}{1.523cm}}
\pgfpathcurveto{\pgfqpoint{1.087cm}{1.497cm}}{\pgfqpoint{1.072cm}{1.463cm}}{\pgfqpoint{1.072cm}{1.426cm}}
\pgfpathcurveto{\pgfqpoint{1.072cm}{1.39cm}}{\pgfqpoint{1.087cm}{1.355cm}}{\pgfqpoint{1.112cm}{1.329cm}}
\pgfpathcurveto{\pgfqpoint{1.138cm}{1.304cm}}{\pgfqpoint{1.172cm}{1.289cm}}{\pgfqpoint{1.209cm}{1.289cm}}
\pgfpathcurveto{\pgfqpoint{1.245cm}{1.289cm}}{\pgfqpoint{1.28cm}{1.304cm}}{\pgfqpoint{1.305cm}{1.329cm}}
\pgfpathcurveto{\pgfqpoint{1.331cm}{1.355cm}}{\pgfqpoint{1.345cm}{1.39cm}}{\pgfqpoint{1.345cm}{1.426cm}}
\pgfusepath{fill}
\begin{pgfscope}
\pgfsetdash{}{0cm}
\pgfsetlinewidth{0.818mm}
\pgfsetroundcap
\pgfsetmiterlimit{4.0}
\pgfpathmoveto{\pgfqpoint{0.682cm}{0.726cm}}
\pgfpathlineto{\pgfqpoint{0.682cm}{0.097cm}}
\pgfusepath{stroke}
\end{pgfscope}
\end{pgfscope}
\end{pgfscope}
\end{pgfscope}
\end{tikzpicture}}} \circ \llbracket X^2 \rrbracket -\frac{b}{3}, \qquad
   X^{\!\resizebox{!}{.8em}{
\begin{tikzpicture}
\pgfpathmoveto{\pgfqpoint{0cm}{-0.035cm}}
\pgfpathlineto{\pgfqpoint{1.976cm}{-0.035cm}}
\pgfpathlineto{\pgfqpoint{1.976cm}{1.94cm}}
\pgfpathlineto{\pgfqpoint{0cm}{1.94cm}}
\pgfpathclose
\pgfusepath{clip}
\begin{pgfscope}
\begin{pgfscope}
\pgfpathmoveto{\pgfqpoint{0cm}{-0.035cm}}
\pgfpathlineto{\pgfqpoint{1.976cm}{-0.035cm}}
\pgfpathlineto{\pgfqpoint{1.976cm}{1.94cm}}
\pgfpathlineto{\pgfqpoint{0cm}{1.94cm}}
\pgfpathclose
\pgfusepath{clip}
\begin{pgfscope}
\begin{pgfscope}
\pgfsetdash{}{0cm}
\pgfsetlinewidth{0.818mm}
\pgfsetroundcap
\pgfsetroundjoin
\pgfsetmiterlimit{7.0}
\definecolor{eps2pgf_color}{gray}{0}\pgfsetstrokecolor{eps2pgf_color}\pgfsetfillcolor{eps2pgf_color}
\pgfpathmoveto{\pgfqpoint{0.117cm}{1.815cm}}
\pgfpathlineto{\pgfqpoint{0.682cm}{1.065cm}}
\pgfpathlineto{\pgfqpoint{1.246cm}{1.815cm}}
\pgfusepath{stroke}
\end{pgfscope}
\definecolor{eps2pgf_color}{gray}{0}\pgfsetstrokecolor{eps2pgf_color}\pgfsetfillcolor{eps2pgf_color}
\pgfpathmoveto{\pgfqpoint{0.273cm}{1.789cm}}
\pgfpathcurveto{\pgfqpoint{0.273cm}{1.825cm}}{\pgfqpoint{0.259cm}{1.86cm}}{\pgfqpoint{0.233cm}{1.886cm}}
\pgfpathcurveto{\pgfqpoint{0.207cm}{1.912cm}}{\pgfqpoint{0.173cm}{1.926cm}}{\pgfqpoint{0.137cm}{1.926cm}}
\pgfpathcurveto{\pgfqpoint{0.1cm}{1.926cm}}{\pgfqpoint{0.066cm}{1.912cm}}{\pgfqpoint{0.04cm}{1.886cm}}
\pgfpathcurveto{\pgfqpoint{0.014cm}{1.86cm}}{\pgfqpoint{0cm}{1.825cm}}{\pgfqpoint{0cm}{1.789cm}}
\pgfpathcurveto{\pgfqpoint{0cm}{1.753cm}}{\pgfqpoint{0.014cm}{1.718cm}}{\pgfqpoint{0.04cm}{1.692cm}}
\pgfpathcurveto{\pgfqpoint{0.066cm}{1.667cm}}{\pgfqpoint{0.1cm}{1.652cm}}{\pgfqpoint{0.137cm}{1.652cm}}
\pgfpathcurveto{\pgfqpoint{0.173cm}{1.652cm}}{\pgfqpoint{0.207cm}{1.667cm}}{\pgfqpoint{0.233cm}{1.692cm}}
\pgfpathcurveto{\pgfqpoint{0.259cm}{1.718cm}}{\pgfqpoint{0.273cm}{1.753cm}}{\pgfqpoint{0.273cm}{1.789cm}}
\pgfusepath{fill}
\begin{pgfscope}
\pgfsetdash{}{0cm}
\pgfsetlinewidth{0.818mm}
\pgfsetmiterlimit{7.0}
\pgfpathmoveto{\pgfqpoint{0.682cm}{1.065cm}}
\pgfpathlineto{\pgfqpoint{0.679cm}{1.812cm}}
\pgfusepath{stroke}
\end{pgfscope}
\pgfpathmoveto{\pgfqpoint{0.815cm}{1.793cm}}
\pgfpathcurveto{\pgfqpoint{0.815cm}{1.829cm}}{\pgfqpoint{0.801cm}{1.864cm}}{\pgfqpoint{0.775cm}{1.89cm}}
\pgfpathcurveto{\pgfqpoint{0.75cm}{1.915cm}}{\pgfqpoint{0.715cm}{1.93cm}}{\pgfqpoint{0.679cm}{1.93cm}}
\pgfpathcurveto{\pgfqpoint{0.643cm}{1.93cm}}{\pgfqpoint{0.608cm}{1.915cm}}{\pgfqpoint{0.582cm}{1.89cm}}
\pgfpathcurveto{\pgfqpoint{0.557cm}{1.864cm}}{\pgfqpoint{0.542cm}{1.829cm}}{\pgfqpoint{0.542cm}{1.793cm}}
\pgfpathcurveto{\pgfqpoint{0.542cm}{1.756cm}}{\pgfqpoint{0.557cm}{1.722cm}}{\pgfqpoint{0.582cm}{1.696cm}}
\pgfpathcurveto{\pgfqpoint{0.608cm}{1.67cm}}{\pgfqpoint{0.643cm}{1.656cm}}{\pgfqpoint{0.679cm}{1.656cm}}
\pgfpathcurveto{\pgfqpoint{0.715cm}{1.656cm}}{\pgfqpoint{0.75cm}{1.67cm}}{\pgfqpoint{0.775cm}{1.696cm}}
\pgfpathcurveto{\pgfqpoint{0.801cm}{1.722cm}}{\pgfqpoint{0.815cm}{1.756cm}}{\pgfqpoint{0.815cm}{1.793cm}}
\pgfusepath{fill}
\pgfpathmoveto{\pgfqpoint{1.345cm}{1.765cm}}
\pgfpathcurveto{\pgfqpoint{1.345cm}{1.801cm}}{\pgfqpoint{1.331cm}{1.836cm}}{\pgfqpoint{1.305cm}{1.862cm}}
\pgfpathcurveto{\pgfqpoint{1.28cm}{1.887cm}}{\pgfqpoint{1.245cm}{1.902cm}}{\pgfqpoint{1.209cm}{1.902cm}}
\pgfpathcurveto{\pgfqpoint{1.172cm}{1.902cm}}{\pgfqpoint{1.138cm}{1.887cm}}{\pgfqpoint{1.112cm}{1.862cm}}
\pgfpathcurveto{\pgfqpoint{1.087cm}{1.836cm}}{\pgfqpoint{1.072cm}{1.801cm}}{\pgfqpoint{1.072cm}{1.765cm}}
\pgfpathcurveto{\pgfqpoint{1.072cm}{1.728cm}}{\pgfqpoint{1.087cm}{1.694cm}}{\pgfqpoint{1.112cm}{1.668cm}}
\pgfpathcurveto{\pgfqpoint{1.138cm}{1.642cm}}{\pgfqpoint{1.172cm}{1.628cm}}{\pgfqpoint{1.209cm}{1.628cm}}
\pgfpathcurveto{\pgfqpoint{1.245cm}{1.628cm}}{\pgfqpoint{1.28cm}{1.642cm}}{\pgfqpoint{1.305cm}{1.668cm}}
\pgfpathcurveto{\pgfqpoint{1.331cm}{1.694cm}}{\pgfqpoint{1.345cm}{1.728cm}}{\pgfqpoint{1.345cm}{1.765cm}}
\pgfusepath{fill}
\begin{pgfscope}
\pgfsetdash{}{0cm}
\pgfsetlinewidth{0.818mm}
\pgfsetroundcap
\pgfsetroundjoin
\pgfsetmiterlimit{7.0}
\pgfpathmoveto{\pgfqpoint{0.682cm}{1.065cm}}
\pgfpathlineto{\pgfqpoint{1.246cm}{0.315cm}}
\pgfpathlineto{\pgfqpoint{1.811cm}{1.065cm}}
\pgfusepath{stroke}
\end{pgfscope}
\pgfpathmoveto{\pgfqpoint{1.948cm}{1.065cm}}
\pgfpathcurveto{\pgfqpoint{1.948cm}{1.101cm}}{\pgfqpoint{1.933cm}{1.136cm}}{\pgfqpoint{1.907cm}{1.162cm}}
\pgfpathcurveto{\pgfqpoint{1.882cm}{1.187cm}}{\pgfqpoint{1.847cm}{1.202cm}}{\pgfqpoint{1.811cm}{1.202cm}}
\pgfpathcurveto{\pgfqpoint{1.775cm}{1.202cm}}{\pgfqpoint{1.74cm}{1.187cm}}{\pgfqpoint{1.714cm}{1.162cm}}
\pgfpathcurveto{\pgfqpoint{1.689cm}{1.136cm}}{\pgfqpoint{1.674cm}{1.101cm}}{\pgfqpoint{1.674cm}{1.065cm}}
\pgfpathcurveto{\pgfqpoint{1.674cm}{1.029cm}}{\pgfqpoint{1.689cm}{0.994cm}}{\pgfqpoint{1.714cm}{0.968cm}}
\pgfpathcurveto{\pgfqpoint{1.74cm}{0.942cm}}{\pgfqpoint{1.775cm}{0.928cm}}{\pgfqpoint{1.811cm}{0.928cm}}
\pgfpathcurveto{\pgfqpoint{1.847cm}{0.928cm}}{\pgfqpoint{1.882cm}{0.942cm}}{\pgfqpoint{1.907cm}{0.968cm}}
\pgfpathcurveto{\pgfqpoint{1.933cm}{0.994cm}}{\pgfqpoint{1.948cm}{1.029cm}}{\pgfqpoint{1.948cm}{1.065cm}}
\pgfusepath{fill}
\begin{pgfscope}
\pgfsetdash{}{0cm}
\pgfsetlinewidth{0.818mm}
\pgfsetmiterlimit{7.0}
\pgfpathmoveto{\pgfqpoint{1.246cm}{0.315cm}}
\pgfpathlineto{\pgfqpoint{1.244cm}{1.061cm}}
\pgfusepath{stroke}
\end{pgfscope}
\pgfpathmoveto{\pgfqpoint{1.38cm}{1.065cm}}
\pgfpathcurveto{\pgfqpoint{1.38cm}{1.101cm}}{\pgfqpoint{1.366cm}{1.136cm}}{\pgfqpoint{1.34cm}{1.162cm}}
\pgfpathcurveto{\pgfqpoint{1.315cm}{1.187cm}}{\pgfqpoint{1.28cm}{1.202cm}}{\pgfqpoint{1.244cm}{1.202cm}}
\pgfpathcurveto{\pgfqpoint{1.207cm}{1.202cm}}{\pgfqpoint{1.173cm}{1.187cm}}{\pgfqpoint{1.147cm}{1.162cm}}
\pgfpathcurveto{\pgfqpoint{1.121cm}{1.136cm}}{\pgfqpoint{1.107cm}{1.101cm}}{\pgfqpoint{1.107cm}{1.065cm}}
\pgfpathcurveto{\pgfqpoint{1.107cm}{1.029cm}}{\pgfqpoint{1.121cm}{0.994cm}}{\pgfqpoint{1.147cm}{0.968cm}}
\pgfpathcurveto{\pgfqpoint{1.173cm}{0.942cm}}{\pgfqpoint{1.207cm}{0.928cm}}{\pgfqpoint{1.244cm}{0.928cm}}
\pgfpathcurveto{\pgfqpoint{1.28cm}{0.928cm}}{\pgfqpoint{1.315cm}{0.942cm}}{\pgfqpoint{1.34cm}{0.968cm}}
\pgfpathcurveto{\pgfqpoint{1.366cm}{0.994cm}}{\pgfqpoint{1.38cm}{1.029cm}}{\pgfqpoint{1.38cm}{1.065cm}}
\pgfusepath{fill}
\begin{pgfscope}
\pgfsetdash{}{0cm}
\pgfsetlinewidth{0.818mm}
\pgfsetmiterlimit{4.0}
\pgfpathmoveto{\pgfqpoint{1.383cm}{0.178cm}}
\pgfpathcurveto{\pgfqpoint{1.383cm}{0.214cm}}{\pgfqpoint{1.369cm}{0.249cm}}{\pgfqpoint{1.343cm}{0.275cm}}
\pgfpathcurveto{\pgfqpoint{1.317cm}{0.3cm}}{\pgfqpoint{1.283cm}{0.315cm}}{\pgfqpoint{1.246cm}{0.315cm}}
\pgfpathcurveto{\pgfqpoint{1.21cm}{0.315cm}}{\pgfqpoint{1.175cm}{0.3cm}}{\pgfqpoint{1.15cm}{0.275cm}}
\pgfpathcurveto{\pgfqpoint{1.124cm}{0.249cm}}{\pgfqpoint{1.11cm}{0.214cm}}{\pgfqpoint{1.11cm}{0.178cm}}
\pgfpathcurveto{\pgfqpoint{1.11cm}{0.141cm}}{\pgfqpoint{1.124cm}{0.107cm}}{\pgfqpoint{1.15cm}{0.081cm}}
\pgfpathcurveto{\pgfqpoint{1.175cm}{0.055cm}}{\pgfqpoint{1.21cm}{0.041cm}}{\pgfqpoint{1.246cm}{0.041cm}}
\pgfpathcurveto{\pgfqpoint{1.283cm}{0.041cm}}{\pgfqpoint{1.317cm}{0.055cm}}{\pgfqpoint{1.343cm}{0.081cm}}
\pgfpathcurveto{\pgfqpoint{1.369cm}{0.107cm}}{\pgfqpoint{1.383cm}{0.141cm}}{\pgfqpoint{1.383cm}{0.178cm}}
\pgfusepath{stroke}
\end{pgfscope}
\end{pgfscope}
\end{pgfscope}
\end{pgfscope}
\end{tikzpicture}}} = X^{\!\resizebox{0.6em}{!}{
\begin{tikzpicture}
\pgfpathmoveto{\pgfqpoint{0cm}{-0.035cm}}
\pgfpathlineto{\pgfqpoint{1.376cm}{-0.035cm}}
\pgfpathlineto{\pgfqpoint{1.376cm}{1.552cm}}
\pgfpathlineto{\pgfqpoint{0cm}{1.552cm}}
\pgfpathclose
\pgfusepath{clip}
\begin{pgfscope}
\begin{pgfscope}
\pgfpathmoveto{\pgfqpoint{0cm}{-0.035cm}}
\pgfpathlineto{\pgfqpoint{1.376cm}{-0.035cm}}
\pgfpathlineto{\pgfqpoint{1.376cm}{1.552cm}}
\pgfpathlineto{\pgfqpoint{0cm}{1.552cm}}
\pgfpathclose
\pgfusepath{clip}
\begin{pgfscope}
\begin{pgfscope}
\pgfsetdash{}{0cm}
\pgfsetlinewidth{0.818mm}
\pgfsetroundcap
\pgfsetroundjoin
\pgfsetmiterlimit{7.0}
\definecolor{eps2pgf_color}{gray}{0}\pgfsetstrokecolor{eps2pgf_color}\pgfsetfillcolor{eps2pgf_color}
\pgfpathmoveto{\pgfqpoint{0.117cm}{1.421cm}}
\pgfpathlineto{\pgfqpoint{0.682cm}{0.671cm}}
\pgfpathlineto{\pgfqpoint{1.246cm}{1.421cm}}
\pgfusepath{stroke}
\end{pgfscope}
\definecolor{eps2pgf_color}{gray}{0}\pgfsetstrokecolor{eps2pgf_color}\pgfsetfillcolor{eps2pgf_color}
\pgfpathmoveto{\pgfqpoint{0.273cm}{1.395cm}}
\pgfpathcurveto{\pgfqpoint{0.273cm}{1.432cm}}{\pgfqpoint{0.259cm}{1.467cm}}{\pgfqpoint{0.233cm}{1.492cm}}
\pgfpathcurveto{\pgfqpoint{0.207cm}{1.518cm}}{\pgfqpoint{0.173cm}{1.532cm}}{\pgfqpoint{0.137cm}{1.532cm}}
\pgfpathcurveto{\pgfqpoint{0.1cm}{1.532cm}}{\pgfqpoint{0.066cm}{1.518cm}}{\pgfqpoint{0.04cm}{1.492cm}}
\pgfpathcurveto{\pgfqpoint{0.014cm}{1.467cm}}{\pgfqpoint{0cm}{1.432cm}}{\pgfqpoint{0cm}{1.395cm}}
\pgfpathcurveto{\pgfqpoint{0cm}{1.359cm}}{\pgfqpoint{0.014cm}{1.324cm}}{\pgfqpoint{0.04cm}{1.299cm}}
\pgfpathcurveto{\pgfqpoint{0.066cm}{1.273cm}}{\pgfqpoint{0.1cm}{1.258cm}}{\pgfqpoint{0.137cm}{1.258cm}}
\pgfpathcurveto{\pgfqpoint{0.173cm}{1.258cm}}{\pgfqpoint{0.207cm}{1.273cm}}{\pgfqpoint{0.233cm}{1.299cm}}
\pgfpathcurveto{\pgfqpoint{0.259cm}{1.324cm}}{\pgfqpoint{0.273cm}{1.359cm}}{\pgfqpoint{0.273cm}{1.395cm}}
\pgfusepath{fill}
\begin{pgfscope}
\pgfsetdash{}{0cm}
\pgfsetlinewidth{0.818mm}
\pgfsetmiterlimit{7.0}
\pgfpathmoveto{\pgfqpoint{0.682cm}{0.671cm}}
\pgfpathlineto{\pgfqpoint{0.679cm}{1.418cm}}
\pgfusepath{stroke}
\end{pgfscope}
\pgfpathmoveto{\pgfqpoint{0.815cm}{1.399cm}}
\pgfpathcurveto{\pgfqpoint{0.815cm}{1.435cm}}{\pgfqpoint{0.801cm}{1.47cm}}{\pgfqpoint{0.775cm}{1.496cm}}
\pgfpathcurveto{\pgfqpoint{0.75cm}{1.521cm}}{\pgfqpoint{0.715cm}{1.536cm}}{\pgfqpoint{0.679cm}{1.536cm}}
\pgfpathcurveto{\pgfqpoint{0.643cm}{1.536cm}}{\pgfqpoint{0.608cm}{1.521cm}}{\pgfqpoint{0.582cm}{1.496cm}}
\pgfpathcurveto{\pgfqpoint{0.557cm}{1.47cm}}{\pgfqpoint{0.542cm}{1.435cm}}{\pgfqpoint{0.542cm}{1.399cm}}
\pgfpathcurveto{\pgfqpoint{0.542cm}{1.363cm}}{\pgfqpoint{0.557cm}{1.328cm}}{\pgfqpoint{0.582cm}{1.302cm}}
\pgfpathcurveto{\pgfqpoint{0.608cm}{1.276cm}}{\pgfqpoint{0.643cm}{1.262cm}}{\pgfqpoint{0.679cm}{1.262cm}}
\pgfpathcurveto{\pgfqpoint{0.715cm}{1.262cm}}{\pgfqpoint{0.75cm}{1.276cm}}{\pgfqpoint{0.775cm}{1.302cm}}
\pgfpathcurveto{\pgfqpoint{0.801cm}{1.328cm}}{\pgfqpoint{0.815cm}{1.363cm}}{\pgfqpoint{0.815cm}{1.399cm}}
\pgfusepath{fill}
\pgfpathmoveto{\pgfqpoint{1.345cm}{1.371cm}}
\pgfpathcurveto{\pgfqpoint{1.345cm}{1.408cm}}{\pgfqpoint{1.331cm}{1.442cm}}{\pgfqpoint{1.305cm}{1.468cm}}
\pgfpathcurveto{\pgfqpoint{1.28cm}{1.494cm}}{\pgfqpoint{1.245cm}{1.508cm}}{\pgfqpoint{1.209cm}{1.508cm}}
\pgfpathcurveto{\pgfqpoint{1.172cm}{1.508cm}}{\pgfqpoint{1.138cm}{1.494cm}}{\pgfqpoint{1.112cm}{1.468cm}}
\pgfpathcurveto{\pgfqpoint{1.087cm}{1.442cm}}{\pgfqpoint{1.072cm}{1.408cm}}{\pgfqpoint{1.072cm}{1.371cm}}
\pgfpathcurveto{\pgfqpoint{1.072cm}{1.335cm}}{\pgfqpoint{1.087cm}{1.3cm}}{\pgfqpoint{1.112cm}{1.274cm}}
\pgfpathcurveto{\pgfqpoint{1.138cm}{1.249cm}}{\pgfqpoint{1.172cm}{1.234cm}}{\pgfqpoint{1.209cm}{1.234cm}}
\pgfpathcurveto{\pgfqpoint{1.245cm}{1.234cm}}{\pgfqpoint{1.28cm}{1.249cm}}{\pgfqpoint{1.305cm}{1.274cm}}
\pgfpathcurveto{\pgfqpoint{1.331cm}{1.3cm}}{\pgfqpoint{1.345cm}{1.335cm}}{\pgfqpoint{1.345cm}{1.371cm}}
\pgfusepath{fill}
\begin{pgfscope}
\pgfsetdash{}{0cm}
\pgfsetlinewidth{0.818mm}
\pgfsetroundcap
\pgfsetmiterlimit{4.0}
\pgfpathmoveto{\pgfqpoint{0.682cm}{0.671cm}}
\pgfpathlineto{\pgfqpoint{0.682cm}{0.042cm}}
\pgfusepath{stroke}
\end{pgfscope}
\end{pgfscope}
\end{pgfscope}
\end{pgfscope}
\end{tikzpicture}}} \circ \llbracket X^2 \rrbracket -  bX. \]
Thus, the left hand side of \eqref{eq:43} rewrites as
\begin{align*}
\begin{aligned}
&\LL \varphi + \varphi^3 + (- 3 a + 3b) \varphi - \xi \\
&\quad= \LL\phi+\LL\psi +3\llbracket X^2 \rrbracket(-X^{\!\resizebox{0.6em}{!}{
\begin{tikzpicture}
\pgfpathmoveto{\pgfqpoint{0cm}{-0.035cm}}
\pgfpathlineto{\pgfqpoint{1.376cm}{-0.035cm}}
\pgfpathlineto{\pgfqpoint{1.376cm}{1.552cm}}
\pgfpathlineto{\pgfqpoint{0cm}{1.552cm}}
\pgfpathclose
\pgfusepath{clip}
\begin{pgfscope}
\begin{pgfscope}
\pgfpathmoveto{\pgfqpoint{0cm}{-0.035cm}}
\pgfpathlineto{\pgfqpoint{1.376cm}{-0.035cm}}
\pgfpathlineto{\pgfqpoint{1.376cm}{1.552cm}}
\pgfpathlineto{\pgfqpoint{0cm}{1.552cm}}
\pgfpathclose
\pgfusepath{clip}
\begin{pgfscope}
\begin{pgfscope}
\pgfsetdash{}{0cm}
\pgfsetlinewidth{0.818mm}
\pgfsetroundcap
\pgfsetroundjoin
\pgfsetmiterlimit{7.0}
\definecolor{eps2pgf_color}{gray}{0}\pgfsetstrokecolor{eps2pgf_color}\pgfsetfillcolor{eps2pgf_color}
\pgfpathmoveto{\pgfqpoint{0.117cm}{1.421cm}}
\pgfpathlineto{\pgfqpoint{0.682cm}{0.671cm}}
\pgfpathlineto{\pgfqpoint{1.246cm}{1.421cm}}
\pgfusepath{stroke}
\end{pgfscope}
\definecolor{eps2pgf_color}{gray}{0}\pgfsetstrokecolor{eps2pgf_color}\pgfsetfillcolor{eps2pgf_color}
\pgfpathmoveto{\pgfqpoint{0.273cm}{1.395cm}}
\pgfpathcurveto{\pgfqpoint{0.273cm}{1.432cm}}{\pgfqpoint{0.259cm}{1.467cm}}{\pgfqpoint{0.233cm}{1.492cm}}
\pgfpathcurveto{\pgfqpoint{0.207cm}{1.518cm}}{\pgfqpoint{0.173cm}{1.532cm}}{\pgfqpoint{0.137cm}{1.532cm}}
\pgfpathcurveto{\pgfqpoint{0.1cm}{1.532cm}}{\pgfqpoint{0.066cm}{1.518cm}}{\pgfqpoint{0.04cm}{1.492cm}}
\pgfpathcurveto{\pgfqpoint{0.014cm}{1.467cm}}{\pgfqpoint{0cm}{1.432cm}}{\pgfqpoint{0cm}{1.395cm}}
\pgfpathcurveto{\pgfqpoint{0cm}{1.359cm}}{\pgfqpoint{0.014cm}{1.324cm}}{\pgfqpoint{0.04cm}{1.299cm}}
\pgfpathcurveto{\pgfqpoint{0.066cm}{1.273cm}}{\pgfqpoint{0.1cm}{1.258cm}}{\pgfqpoint{0.137cm}{1.258cm}}
\pgfpathcurveto{\pgfqpoint{0.173cm}{1.258cm}}{\pgfqpoint{0.207cm}{1.273cm}}{\pgfqpoint{0.233cm}{1.299cm}}
\pgfpathcurveto{\pgfqpoint{0.259cm}{1.324cm}}{\pgfqpoint{0.273cm}{1.359cm}}{\pgfqpoint{0.273cm}{1.395cm}}
\pgfusepath{fill}
\begin{pgfscope}
\pgfsetdash{}{0cm}
\pgfsetlinewidth{0.818mm}
\pgfsetmiterlimit{7.0}
\pgfpathmoveto{\pgfqpoint{0.682cm}{0.671cm}}
\pgfpathlineto{\pgfqpoint{0.679cm}{1.418cm}}
\pgfusepath{stroke}
\end{pgfscope}
\pgfpathmoveto{\pgfqpoint{0.815cm}{1.399cm}}
\pgfpathcurveto{\pgfqpoint{0.815cm}{1.435cm}}{\pgfqpoint{0.801cm}{1.47cm}}{\pgfqpoint{0.775cm}{1.496cm}}
\pgfpathcurveto{\pgfqpoint{0.75cm}{1.521cm}}{\pgfqpoint{0.715cm}{1.536cm}}{\pgfqpoint{0.679cm}{1.536cm}}
\pgfpathcurveto{\pgfqpoint{0.643cm}{1.536cm}}{\pgfqpoint{0.608cm}{1.521cm}}{\pgfqpoint{0.582cm}{1.496cm}}
\pgfpathcurveto{\pgfqpoint{0.557cm}{1.47cm}}{\pgfqpoint{0.542cm}{1.435cm}}{\pgfqpoint{0.542cm}{1.399cm}}
\pgfpathcurveto{\pgfqpoint{0.542cm}{1.363cm}}{\pgfqpoint{0.557cm}{1.328cm}}{\pgfqpoint{0.582cm}{1.302cm}}
\pgfpathcurveto{\pgfqpoint{0.608cm}{1.276cm}}{\pgfqpoint{0.643cm}{1.262cm}}{\pgfqpoint{0.679cm}{1.262cm}}
\pgfpathcurveto{\pgfqpoint{0.715cm}{1.262cm}}{\pgfqpoint{0.75cm}{1.276cm}}{\pgfqpoint{0.775cm}{1.302cm}}
\pgfpathcurveto{\pgfqpoint{0.801cm}{1.328cm}}{\pgfqpoint{0.815cm}{1.363cm}}{\pgfqpoint{0.815cm}{1.399cm}}
\pgfusepath{fill}
\pgfpathmoveto{\pgfqpoint{1.345cm}{1.371cm}}
\pgfpathcurveto{\pgfqpoint{1.345cm}{1.408cm}}{\pgfqpoint{1.331cm}{1.442cm}}{\pgfqpoint{1.305cm}{1.468cm}}
\pgfpathcurveto{\pgfqpoint{1.28cm}{1.494cm}}{\pgfqpoint{1.245cm}{1.508cm}}{\pgfqpoint{1.209cm}{1.508cm}}
\pgfpathcurveto{\pgfqpoint{1.172cm}{1.508cm}}{\pgfqpoint{1.138cm}{1.494cm}}{\pgfqpoint{1.112cm}{1.468cm}}
\pgfpathcurveto{\pgfqpoint{1.087cm}{1.442cm}}{\pgfqpoint{1.072cm}{1.408cm}}{\pgfqpoint{1.072cm}{1.371cm}}
\pgfpathcurveto{\pgfqpoint{1.072cm}{1.335cm}}{\pgfqpoint{1.087cm}{1.3cm}}{\pgfqpoint{1.112cm}{1.274cm}}
\pgfpathcurveto{\pgfqpoint{1.138cm}{1.249cm}}{\pgfqpoint{1.172cm}{1.234cm}}{\pgfqpoint{1.209cm}{1.234cm}}
\pgfpathcurveto{\pgfqpoint{1.245cm}{1.234cm}}{\pgfqpoint{1.28cm}{1.249cm}}{\pgfqpoint{1.305cm}{1.274cm}}
\pgfpathcurveto{\pgfqpoint{1.331cm}{1.3cm}}{\pgfqpoint{1.345cm}{1.335cm}}{\pgfqpoint{1.345cm}{1.371cm}}
\pgfusepath{fill}
\begin{pgfscope}
\pgfsetdash{}{0cm}
\pgfsetlinewidth{0.818mm}
\pgfsetroundcap
\pgfsetmiterlimit{4.0}
\pgfpathmoveto{\pgfqpoint{0.682cm}{0.671cm}}
\pgfpathlineto{\pgfqpoint{0.682cm}{0.042cm}}
\pgfusepath{stroke}
\end{pgfscope}
\end{pgfscope}
\end{pgfscope}
\end{pgfscope}
\end{tikzpicture}}}+\phi+\psi)+3X(-X^{\!\resizebox{0.6em}{!}{
\begin{tikzpicture}
\pgfpathmoveto{\pgfqpoint{0cm}{-0.035cm}}
\pgfpathlineto{\pgfqpoint{1.376cm}{-0.035cm}}
\pgfpathlineto{\pgfqpoint{1.376cm}{1.552cm}}
\pgfpathlineto{\pgfqpoint{0cm}{1.552cm}}
\pgfpathclose
\pgfusepath{clip}
\begin{pgfscope}
\begin{pgfscope}
\pgfpathmoveto{\pgfqpoint{0cm}{-0.035cm}}
\pgfpathlineto{\pgfqpoint{1.376cm}{-0.035cm}}
\pgfpathlineto{\pgfqpoint{1.376cm}{1.552cm}}
\pgfpathlineto{\pgfqpoint{0cm}{1.552cm}}
\pgfpathclose
\pgfusepath{clip}
\begin{pgfscope}
\begin{pgfscope}
\pgfsetdash{}{0cm}
\pgfsetlinewidth{0.818mm}
\pgfsetroundcap
\pgfsetroundjoin
\pgfsetmiterlimit{7.0}
\definecolor{eps2pgf_color}{gray}{0}\pgfsetstrokecolor{eps2pgf_color}\pgfsetfillcolor{eps2pgf_color}
\pgfpathmoveto{\pgfqpoint{0.117cm}{1.421cm}}
\pgfpathlineto{\pgfqpoint{0.682cm}{0.671cm}}
\pgfpathlineto{\pgfqpoint{1.246cm}{1.421cm}}
\pgfusepath{stroke}
\end{pgfscope}
\definecolor{eps2pgf_color}{gray}{0}\pgfsetstrokecolor{eps2pgf_color}\pgfsetfillcolor{eps2pgf_color}
\pgfpathmoveto{\pgfqpoint{0.273cm}{1.395cm}}
\pgfpathcurveto{\pgfqpoint{0.273cm}{1.432cm}}{\pgfqpoint{0.259cm}{1.467cm}}{\pgfqpoint{0.233cm}{1.492cm}}
\pgfpathcurveto{\pgfqpoint{0.207cm}{1.518cm}}{\pgfqpoint{0.173cm}{1.532cm}}{\pgfqpoint{0.137cm}{1.532cm}}
\pgfpathcurveto{\pgfqpoint{0.1cm}{1.532cm}}{\pgfqpoint{0.066cm}{1.518cm}}{\pgfqpoint{0.04cm}{1.492cm}}
\pgfpathcurveto{\pgfqpoint{0.014cm}{1.467cm}}{\pgfqpoint{0cm}{1.432cm}}{\pgfqpoint{0cm}{1.395cm}}
\pgfpathcurveto{\pgfqpoint{0cm}{1.359cm}}{\pgfqpoint{0.014cm}{1.324cm}}{\pgfqpoint{0.04cm}{1.299cm}}
\pgfpathcurveto{\pgfqpoint{0.066cm}{1.273cm}}{\pgfqpoint{0.1cm}{1.258cm}}{\pgfqpoint{0.137cm}{1.258cm}}
\pgfpathcurveto{\pgfqpoint{0.173cm}{1.258cm}}{\pgfqpoint{0.207cm}{1.273cm}}{\pgfqpoint{0.233cm}{1.299cm}}
\pgfpathcurveto{\pgfqpoint{0.259cm}{1.324cm}}{\pgfqpoint{0.273cm}{1.359cm}}{\pgfqpoint{0.273cm}{1.395cm}}
\pgfusepath{fill}
\begin{pgfscope}
\pgfsetdash{}{0cm}
\pgfsetlinewidth{0.818mm}
\pgfsetmiterlimit{7.0}
\pgfpathmoveto{\pgfqpoint{0.682cm}{0.671cm}}
\pgfpathlineto{\pgfqpoint{0.679cm}{1.418cm}}
\pgfusepath{stroke}
\end{pgfscope}
\pgfpathmoveto{\pgfqpoint{0.815cm}{1.399cm}}
\pgfpathcurveto{\pgfqpoint{0.815cm}{1.435cm}}{\pgfqpoint{0.801cm}{1.47cm}}{\pgfqpoint{0.775cm}{1.496cm}}
\pgfpathcurveto{\pgfqpoint{0.75cm}{1.521cm}}{\pgfqpoint{0.715cm}{1.536cm}}{\pgfqpoint{0.679cm}{1.536cm}}
\pgfpathcurveto{\pgfqpoint{0.643cm}{1.536cm}}{\pgfqpoint{0.608cm}{1.521cm}}{\pgfqpoint{0.582cm}{1.496cm}}
\pgfpathcurveto{\pgfqpoint{0.557cm}{1.47cm}}{\pgfqpoint{0.542cm}{1.435cm}}{\pgfqpoint{0.542cm}{1.399cm}}
\pgfpathcurveto{\pgfqpoint{0.542cm}{1.363cm}}{\pgfqpoint{0.557cm}{1.328cm}}{\pgfqpoint{0.582cm}{1.302cm}}
\pgfpathcurveto{\pgfqpoint{0.608cm}{1.276cm}}{\pgfqpoint{0.643cm}{1.262cm}}{\pgfqpoint{0.679cm}{1.262cm}}
\pgfpathcurveto{\pgfqpoint{0.715cm}{1.262cm}}{\pgfqpoint{0.75cm}{1.276cm}}{\pgfqpoint{0.775cm}{1.302cm}}
\pgfpathcurveto{\pgfqpoint{0.801cm}{1.328cm}}{\pgfqpoint{0.815cm}{1.363cm}}{\pgfqpoint{0.815cm}{1.399cm}}
\pgfusepath{fill}
\pgfpathmoveto{\pgfqpoint{1.345cm}{1.371cm}}
\pgfpathcurveto{\pgfqpoint{1.345cm}{1.408cm}}{\pgfqpoint{1.331cm}{1.442cm}}{\pgfqpoint{1.305cm}{1.468cm}}
\pgfpathcurveto{\pgfqpoint{1.28cm}{1.494cm}}{\pgfqpoint{1.245cm}{1.508cm}}{\pgfqpoint{1.209cm}{1.508cm}}
\pgfpathcurveto{\pgfqpoint{1.172cm}{1.508cm}}{\pgfqpoint{1.138cm}{1.494cm}}{\pgfqpoint{1.112cm}{1.468cm}}
\pgfpathcurveto{\pgfqpoint{1.087cm}{1.442cm}}{\pgfqpoint{1.072cm}{1.408cm}}{\pgfqpoint{1.072cm}{1.371cm}}
\pgfpathcurveto{\pgfqpoint{1.072cm}{1.335cm}}{\pgfqpoint{1.087cm}{1.3cm}}{\pgfqpoint{1.112cm}{1.274cm}}
\pgfpathcurveto{\pgfqpoint{1.138cm}{1.249cm}}{\pgfqpoint{1.172cm}{1.234cm}}{\pgfqpoint{1.209cm}{1.234cm}}
\pgfpathcurveto{\pgfqpoint{1.245cm}{1.234cm}}{\pgfqpoint{1.28cm}{1.249cm}}{\pgfqpoint{1.305cm}{1.274cm}}
\pgfpathcurveto{\pgfqpoint{1.331cm}{1.3cm}}{\pgfqpoint{1.345cm}{1.335cm}}{\pgfqpoint{1.345cm}{1.371cm}}
\pgfusepath{fill}
\begin{pgfscope}
\pgfsetdash{}{0cm}
\pgfsetlinewidth{0.818mm}
\pgfsetroundcap
\pgfsetmiterlimit{4.0}
\pgfpathmoveto{\pgfqpoint{0.682cm}{0.671cm}}
\pgfpathlineto{\pgfqpoint{0.682cm}{0.042cm}}
\pgfusepath{stroke}
\end{pgfscope}
\end{pgfscope}
\end{pgfscope}
\end{pgfscope}
\end{tikzpicture}}}+\phi+\psi)^2+(-X^{\!\resizebox{0.6em}{!}{
\begin{tikzpicture}
\pgfpathmoveto{\pgfqpoint{0cm}{-0.035cm}}
\pgfpathlineto{\pgfqpoint{1.376cm}{-0.035cm}}
\pgfpathlineto{\pgfqpoint{1.376cm}{1.552cm}}
\pgfpathlineto{\pgfqpoint{0cm}{1.552cm}}
\pgfpathclose
\pgfusepath{clip}
\begin{pgfscope}
\begin{pgfscope}
\pgfpathmoveto{\pgfqpoint{0cm}{-0.035cm}}
\pgfpathlineto{\pgfqpoint{1.376cm}{-0.035cm}}
\pgfpathlineto{\pgfqpoint{1.376cm}{1.552cm}}
\pgfpathlineto{\pgfqpoint{0cm}{1.552cm}}
\pgfpathclose
\pgfusepath{clip}
\begin{pgfscope}
\begin{pgfscope}
\pgfsetdash{}{0cm}
\pgfsetlinewidth{0.818mm}
\pgfsetroundcap
\pgfsetroundjoin
\pgfsetmiterlimit{7.0}
\definecolor{eps2pgf_color}{gray}{0}\pgfsetstrokecolor{eps2pgf_color}\pgfsetfillcolor{eps2pgf_color}
\pgfpathmoveto{\pgfqpoint{0.117cm}{1.421cm}}
\pgfpathlineto{\pgfqpoint{0.682cm}{0.671cm}}
\pgfpathlineto{\pgfqpoint{1.246cm}{1.421cm}}
\pgfusepath{stroke}
\end{pgfscope}
\definecolor{eps2pgf_color}{gray}{0}\pgfsetstrokecolor{eps2pgf_color}\pgfsetfillcolor{eps2pgf_color}
\pgfpathmoveto{\pgfqpoint{0.273cm}{1.395cm}}
\pgfpathcurveto{\pgfqpoint{0.273cm}{1.432cm}}{\pgfqpoint{0.259cm}{1.467cm}}{\pgfqpoint{0.233cm}{1.492cm}}
\pgfpathcurveto{\pgfqpoint{0.207cm}{1.518cm}}{\pgfqpoint{0.173cm}{1.532cm}}{\pgfqpoint{0.137cm}{1.532cm}}
\pgfpathcurveto{\pgfqpoint{0.1cm}{1.532cm}}{\pgfqpoint{0.066cm}{1.518cm}}{\pgfqpoint{0.04cm}{1.492cm}}
\pgfpathcurveto{\pgfqpoint{0.014cm}{1.467cm}}{\pgfqpoint{0cm}{1.432cm}}{\pgfqpoint{0cm}{1.395cm}}
\pgfpathcurveto{\pgfqpoint{0cm}{1.359cm}}{\pgfqpoint{0.014cm}{1.324cm}}{\pgfqpoint{0.04cm}{1.299cm}}
\pgfpathcurveto{\pgfqpoint{0.066cm}{1.273cm}}{\pgfqpoint{0.1cm}{1.258cm}}{\pgfqpoint{0.137cm}{1.258cm}}
\pgfpathcurveto{\pgfqpoint{0.173cm}{1.258cm}}{\pgfqpoint{0.207cm}{1.273cm}}{\pgfqpoint{0.233cm}{1.299cm}}
\pgfpathcurveto{\pgfqpoint{0.259cm}{1.324cm}}{\pgfqpoint{0.273cm}{1.359cm}}{\pgfqpoint{0.273cm}{1.395cm}}
\pgfusepath{fill}
\begin{pgfscope}
\pgfsetdash{}{0cm}
\pgfsetlinewidth{0.818mm}
\pgfsetmiterlimit{7.0}
\pgfpathmoveto{\pgfqpoint{0.682cm}{0.671cm}}
\pgfpathlineto{\pgfqpoint{0.679cm}{1.418cm}}
\pgfusepath{stroke}
\end{pgfscope}
\pgfpathmoveto{\pgfqpoint{0.815cm}{1.399cm}}
\pgfpathcurveto{\pgfqpoint{0.815cm}{1.435cm}}{\pgfqpoint{0.801cm}{1.47cm}}{\pgfqpoint{0.775cm}{1.496cm}}
\pgfpathcurveto{\pgfqpoint{0.75cm}{1.521cm}}{\pgfqpoint{0.715cm}{1.536cm}}{\pgfqpoint{0.679cm}{1.536cm}}
\pgfpathcurveto{\pgfqpoint{0.643cm}{1.536cm}}{\pgfqpoint{0.608cm}{1.521cm}}{\pgfqpoint{0.582cm}{1.496cm}}
\pgfpathcurveto{\pgfqpoint{0.557cm}{1.47cm}}{\pgfqpoint{0.542cm}{1.435cm}}{\pgfqpoint{0.542cm}{1.399cm}}
\pgfpathcurveto{\pgfqpoint{0.542cm}{1.363cm}}{\pgfqpoint{0.557cm}{1.328cm}}{\pgfqpoint{0.582cm}{1.302cm}}
\pgfpathcurveto{\pgfqpoint{0.608cm}{1.276cm}}{\pgfqpoint{0.643cm}{1.262cm}}{\pgfqpoint{0.679cm}{1.262cm}}
\pgfpathcurveto{\pgfqpoint{0.715cm}{1.262cm}}{\pgfqpoint{0.75cm}{1.276cm}}{\pgfqpoint{0.775cm}{1.302cm}}
\pgfpathcurveto{\pgfqpoint{0.801cm}{1.328cm}}{\pgfqpoint{0.815cm}{1.363cm}}{\pgfqpoint{0.815cm}{1.399cm}}
\pgfusepath{fill}
\pgfpathmoveto{\pgfqpoint{1.345cm}{1.371cm}}
\pgfpathcurveto{\pgfqpoint{1.345cm}{1.408cm}}{\pgfqpoint{1.331cm}{1.442cm}}{\pgfqpoint{1.305cm}{1.468cm}}
\pgfpathcurveto{\pgfqpoint{1.28cm}{1.494cm}}{\pgfqpoint{1.245cm}{1.508cm}}{\pgfqpoint{1.209cm}{1.508cm}}
\pgfpathcurveto{\pgfqpoint{1.172cm}{1.508cm}}{\pgfqpoint{1.138cm}{1.494cm}}{\pgfqpoint{1.112cm}{1.468cm}}
\pgfpathcurveto{\pgfqpoint{1.087cm}{1.442cm}}{\pgfqpoint{1.072cm}{1.408cm}}{\pgfqpoint{1.072cm}{1.371cm}}
\pgfpathcurveto{\pgfqpoint{1.072cm}{1.335cm}}{\pgfqpoint{1.087cm}{1.3cm}}{\pgfqpoint{1.112cm}{1.274cm}}
\pgfpathcurveto{\pgfqpoint{1.138cm}{1.249cm}}{\pgfqpoint{1.172cm}{1.234cm}}{\pgfqpoint{1.209cm}{1.234cm}}
\pgfpathcurveto{\pgfqpoint{1.245cm}{1.234cm}}{\pgfqpoint{1.28cm}{1.249cm}}{\pgfqpoint{1.305cm}{1.274cm}}
\pgfpathcurveto{\pgfqpoint{1.331cm}{1.3cm}}{\pgfqpoint{1.345cm}{1.335cm}}{\pgfqpoint{1.345cm}{1.371cm}}
\pgfusepath{fill}
\begin{pgfscope}
\pgfsetdash{}{0cm}
\pgfsetlinewidth{0.818mm}
\pgfsetroundcap
\pgfsetmiterlimit{4.0}
\pgfpathmoveto{\pgfqpoint{0.682cm}{0.671cm}}
\pgfpathlineto{\pgfqpoint{0.682cm}{0.042cm}}
\pgfusepath{stroke}
\end{pgfscope}
\end{pgfscope}
\end{pgfscope}
\end{pgfscope}
\end{tikzpicture}}}+\phi+\psi)^3+3b\varphi.
\end{aligned}
\end{align*}

In Section \ref{sec:42} we were able to apply the elliptic a priori estimates pointwise in time.
In view of the decomposition of the elliptic $\Phi^{4}_{5}$ model, this is no longer possible here. The difficulty arises in the paracontrolled ansatz similar to \eqref{eq:th} and the associated commutator as in \eqref{eq:rhs45a}. Indeed, since the differential operator in \eqref{eq:43} includes also time derivative, we require sufficient time regularity in order to control the  commutator. To this end, we make use of the time-mollified paraproduct, introduced in Section \ref{ssec:para}, and  apply Lemma \ref{lem:5.1}. More precisely, we assume that $\phi$ is paracontrolled by $X^{\!\resizebox{0.6em}{!}{
\begin{tikzpicture}
\pgfpathmoveto{\pgfqpoint{0cm}{0cm}}
\pgfpathlineto{\pgfqpoint{1.376cm}{0cm}}
\pgfpathlineto{\pgfqpoint{1.376cm}{1.588cm}}
\pgfpathlineto{\pgfqpoint{0cm}{1.588cm}}
\pgfpathclose
\pgfusepath{clip}
\begin{pgfscope}
\begin{pgfscope}
\pgfpathmoveto{\pgfqpoint{0cm}{0cm}}
\pgfpathlineto{\pgfqpoint{1.376cm}{0cm}}
\pgfpathlineto{\pgfqpoint{1.376cm}{1.588cm}}
\pgfpathlineto{\pgfqpoint{0cm}{1.588cm}}
\pgfpathclose
\pgfusepath{clip}
\begin{pgfscope}
\begin{pgfscope}
\definecolor{eps2pgf_color}{gray}{0.976471}\pgfsetstrokecolor{eps2pgf_color}\pgfsetfillcolor{eps2pgf_color}
\pgfpathmoveto{\pgfqpoint{0cm}{0cm}}
\pgfpathlineto{\pgfqpoint{1.376cm}{0cm}}
\pgfpathlineto{\pgfqpoint{1.376cm}{1.588cm}}
\pgfpathlineto{\pgfqpoint{0cm}{1.588cm}}
\pgfpathclose
\pgfusepath{fill}
\end{pgfscope}
\begin{pgfscope}
\pgfsetdash{}{0cm}
\pgfsetlinewidth{0.818mm}
\pgfsetroundcap
\pgfsetroundjoin
\pgfsetmiterlimit{7.0}
\definecolor{eps2pgf_color}{gray}{0}\pgfsetstrokecolor{eps2pgf_color}\pgfsetfillcolor{eps2pgf_color}
\pgfpathmoveto{\pgfqpoint{0.117cm}{1.476cm}}
\pgfpathlineto{\pgfqpoint{0.682cm}{0.726cm}}
\pgfpathlineto{\pgfqpoint{1.246cm}{1.476cm}}
\pgfusepath{stroke}
\end{pgfscope}
\definecolor{eps2pgf_color}{gray}{0}\pgfsetstrokecolor{eps2pgf_color}\pgfsetfillcolor{eps2pgf_color}
\pgfpathmoveto{\pgfqpoint{0.273cm}{1.451cm}}
\pgfpathcurveto{\pgfqpoint{0.273cm}{1.487cm}}{\pgfqpoint{0.259cm}{1.522cm}}{\pgfqpoint{0.233cm}{1.547cm}}
\pgfpathcurveto{\pgfqpoint{0.207cm}{1.573cm}}{\pgfqpoint{0.173cm}{1.588cm}}{\pgfqpoint{0.137cm}{1.588cm}}
\pgfpathcurveto{\pgfqpoint{0.1cm}{1.588cm}}{\pgfqpoint{0.066cm}{1.573cm}}{\pgfqpoint{0.04cm}{1.547cm}}
\pgfpathcurveto{\pgfqpoint{0.014cm}{1.522cm}}{\pgfqpoint{0cm}{1.487cm}}{\pgfqpoint{0cm}{1.451cm}}
\pgfpathcurveto{\pgfqpoint{0cm}{1.414cm}}{\pgfqpoint{0.014cm}{1.379cm}}{\pgfqpoint{0.04cm}{1.354cm}}
\pgfpathcurveto{\pgfqpoint{0.066cm}{1.328cm}}{\pgfqpoint{0.1cm}{1.314cm}}{\pgfqpoint{0.137cm}{1.314cm}}
\pgfpathcurveto{\pgfqpoint{0.173cm}{1.314cm}}{\pgfqpoint{0.207cm}{1.328cm}}{\pgfqpoint{0.233cm}{1.354cm}}
\pgfpathcurveto{\pgfqpoint{0.259cm}{1.379cm}}{\pgfqpoint{0.273cm}{1.414cm}}{\pgfqpoint{0.273cm}{1.451cm}}
\pgfusepath{fill}
\pgfpathmoveto{\pgfqpoint{1.345cm}{1.426cm}}
\pgfpathcurveto{\pgfqpoint{1.345cm}{1.463cm}}{\pgfqpoint{1.331cm}{1.497cm}}{\pgfqpoint{1.305cm}{1.523cm}}
\pgfpathcurveto{\pgfqpoint{1.28cm}{1.549cm}}{\pgfqpoint{1.245cm}{1.563cm}}{\pgfqpoint{1.209cm}{1.563cm}}
\pgfpathcurveto{\pgfqpoint{1.172cm}{1.563cm}}{\pgfqpoint{1.138cm}{1.549cm}}{\pgfqpoint{1.112cm}{1.523cm}}
\pgfpathcurveto{\pgfqpoint{1.087cm}{1.497cm}}{\pgfqpoint{1.072cm}{1.463cm}}{\pgfqpoint{1.072cm}{1.426cm}}
\pgfpathcurveto{\pgfqpoint{1.072cm}{1.39cm}}{\pgfqpoint{1.087cm}{1.355cm}}{\pgfqpoint{1.112cm}{1.329cm}}
\pgfpathcurveto{\pgfqpoint{1.138cm}{1.304cm}}{\pgfqpoint{1.172cm}{1.289cm}}{\pgfqpoint{1.209cm}{1.289cm}}
\pgfpathcurveto{\pgfqpoint{1.245cm}{1.289cm}}{\pgfqpoint{1.28cm}{1.304cm}}{\pgfqpoint{1.305cm}{1.329cm}}
\pgfpathcurveto{\pgfqpoint{1.331cm}{1.355cm}}{\pgfqpoint{1.345cm}{1.39cm}}{\pgfqpoint{1.345cm}{1.426cm}}
\pgfusepath{fill}
\begin{pgfscope}
\pgfsetdash{}{0cm}
\pgfsetlinewidth{0.818mm}
\pgfsetroundcap
\pgfsetmiterlimit{4.0}
\pgfpathmoveto{\pgfqpoint{0.682cm}{0.726cm}}
\pgfpathlineto{\pgfqpoint{0.682cm}{0.097cm}}
\pgfusepath{stroke}
\end{pgfscope}
\end{pgfscope}
\end{pgfscope}
\end{pgfscope}
\end{tikzpicture}}}$, namely, it holds
\begin{align}\label{eq:th43}
\phi=\vartheta -3(-X^{\!\resizebox{0.6em}{!}{
\begin{tikzpicture}
\pgfpathmoveto{\pgfqpoint{0cm}{-0.035cm}}
\pgfpathlineto{\pgfqpoint{1.376cm}{-0.035cm}}
\pgfpathlineto{\pgfqpoint{1.376cm}{1.552cm}}
\pgfpathlineto{\pgfqpoint{0cm}{1.552cm}}
\pgfpathclose
\pgfusepath{clip}
\begin{pgfscope}
\begin{pgfscope}
\pgfpathmoveto{\pgfqpoint{0cm}{-0.035cm}}
\pgfpathlineto{\pgfqpoint{1.376cm}{-0.035cm}}
\pgfpathlineto{\pgfqpoint{1.376cm}{1.552cm}}
\pgfpathlineto{\pgfqpoint{0cm}{1.552cm}}
\pgfpathclose
\pgfusepath{clip}
\begin{pgfscope}
\begin{pgfscope}
\pgfsetdash{}{0cm}
\pgfsetlinewidth{0.818mm}
\pgfsetroundcap
\pgfsetroundjoin
\pgfsetmiterlimit{7.0}
\definecolor{eps2pgf_color}{gray}{0}\pgfsetstrokecolor{eps2pgf_color}\pgfsetfillcolor{eps2pgf_color}
\pgfpathmoveto{\pgfqpoint{0.117cm}{1.421cm}}
\pgfpathlineto{\pgfqpoint{0.682cm}{0.671cm}}
\pgfpathlineto{\pgfqpoint{1.246cm}{1.421cm}}
\pgfusepath{stroke}
\end{pgfscope}
\definecolor{eps2pgf_color}{gray}{0}\pgfsetstrokecolor{eps2pgf_color}\pgfsetfillcolor{eps2pgf_color}
\pgfpathmoveto{\pgfqpoint{0.273cm}{1.395cm}}
\pgfpathcurveto{\pgfqpoint{0.273cm}{1.432cm}}{\pgfqpoint{0.259cm}{1.467cm}}{\pgfqpoint{0.233cm}{1.492cm}}
\pgfpathcurveto{\pgfqpoint{0.207cm}{1.518cm}}{\pgfqpoint{0.173cm}{1.532cm}}{\pgfqpoint{0.137cm}{1.532cm}}
\pgfpathcurveto{\pgfqpoint{0.1cm}{1.532cm}}{\pgfqpoint{0.066cm}{1.518cm}}{\pgfqpoint{0.04cm}{1.492cm}}
\pgfpathcurveto{\pgfqpoint{0.014cm}{1.467cm}}{\pgfqpoint{0cm}{1.432cm}}{\pgfqpoint{0cm}{1.395cm}}
\pgfpathcurveto{\pgfqpoint{0cm}{1.359cm}}{\pgfqpoint{0.014cm}{1.324cm}}{\pgfqpoint{0.04cm}{1.299cm}}
\pgfpathcurveto{\pgfqpoint{0.066cm}{1.273cm}}{\pgfqpoint{0.1cm}{1.258cm}}{\pgfqpoint{0.137cm}{1.258cm}}
\pgfpathcurveto{\pgfqpoint{0.173cm}{1.258cm}}{\pgfqpoint{0.207cm}{1.273cm}}{\pgfqpoint{0.233cm}{1.299cm}}
\pgfpathcurveto{\pgfqpoint{0.259cm}{1.324cm}}{\pgfqpoint{0.273cm}{1.359cm}}{\pgfqpoint{0.273cm}{1.395cm}}
\pgfusepath{fill}
\begin{pgfscope}
\pgfsetdash{}{0cm}
\pgfsetlinewidth{0.818mm}
\pgfsetmiterlimit{7.0}
\pgfpathmoveto{\pgfqpoint{0.682cm}{0.671cm}}
\pgfpathlineto{\pgfqpoint{0.679cm}{1.418cm}}
\pgfusepath{stroke}
\end{pgfscope}
\pgfpathmoveto{\pgfqpoint{0.815cm}{1.399cm}}
\pgfpathcurveto{\pgfqpoint{0.815cm}{1.435cm}}{\pgfqpoint{0.801cm}{1.47cm}}{\pgfqpoint{0.775cm}{1.496cm}}
\pgfpathcurveto{\pgfqpoint{0.75cm}{1.521cm}}{\pgfqpoint{0.715cm}{1.536cm}}{\pgfqpoint{0.679cm}{1.536cm}}
\pgfpathcurveto{\pgfqpoint{0.643cm}{1.536cm}}{\pgfqpoint{0.608cm}{1.521cm}}{\pgfqpoint{0.582cm}{1.496cm}}
\pgfpathcurveto{\pgfqpoint{0.557cm}{1.47cm}}{\pgfqpoint{0.542cm}{1.435cm}}{\pgfqpoint{0.542cm}{1.399cm}}
\pgfpathcurveto{\pgfqpoint{0.542cm}{1.363cm}}{\pgfqpoint{0.557cm}{1.328cm}}{\pgfqpoint{0.582cm}{1.302cm}}
\pgfpathcurveto{\pgfqpoint{0.608cm}{1.276cm}}{\pgfqpoint{0.643cm}{1.262cm}}{\pgfqpoint{0.679cm}{1.262cm}}
\pgfpathcurveto{\pgfqpoint{0.715cm}{1.262cm}}{\pgfqpoint{0.75cm}{1.276cm}}{\pgfqpoint{0.775cm}{1.302cm}}
\pgfpathcurveto{\pgfqpoint{0.801cm}{1.328cm}}{\pgfqpoint{0.815cm}{1.363cm}}{\pgfqpoint{0.815cm}{1.399cm}}
\pgfusepath{fill}
\pgfpathmoveto{\pgfqpoint{1.345cm}{1.371cm}}
\pgfpathcurveto{\pgfqpoint{1.345cm}{1.408cm}}{\pgfqpoint{1.331cm}{1.442cm}}{\pgfqpoint{1.305cm}{1.468cm}}
\pgfpathcurveto{\pgfqpoint{1.28cm}{1.494cm}}{\pgfqpoint{1.245cm}{1.508cm}}{\pgfqpoint{1.209cm}{1.508cm}}
\pgfpathcurveto{\pgfqpoint{1.172cm}{1.508cm}}{\pgfqpoint{1.138cm}{1.494cm}}{\pgfqpoint{1.112cm}{1.468cm}}
\pgfpathcurveto{\pgfqpoint{1.087cm}{1.442cm}}{\pgfqpoint{1.072cm}{1.408cm}}{\pgfqpoint{1.072cm}{1.371cm}}
\pgfpathcurveto{\pgfqpoint{1.072cm}{1.335cm}}{\pgfqpoint{1.087cm}{1.3cm}}{\pgfqpoint{1.112cm}{1.274cm}}
\pgfpathcurveto{\pgfqpoint{1.138cm}{1.249cm}}{\pgfqpoint{1.172cm}{1.234cm}}{\pgfqpoint{1.209cm}{1.234cm}}
\pgfpathcurveto{\pgfqpoint{1.245cm}{1.234cm}}{\pgfqpoint{1.28cm}{1.249cm}}{\pgfqpoint{1.305cm}{1.274cm}}
\pgfpathcurveto{\pgfqpoint{1.331cm}{1.3cm}}{\pgfqpoint{1.345cm}{1.335cm}}{\pgfqpoint{1.345cm}{1.371cm}}
\pgfusepath{fill}
\begin{pgfscope}
\pgfsetdash{}{0cm}
\pgfsetlinewidth{0.818mm}
\pgfsetroundcap
\pgfsetmiterlimit{4.0}
\pgfpathmoveto{\pgfqpoint{0.682cm}{0.671cm}}
\pgfpathlineto{\pgfqpoint{0.682cm}{0.042cm}}
\pgfusepath{stroke}
\end{pgfscope}
\end{pgfscope}
\end{pgfscope}
\end{pgfscope}
\end{tikzpicture}}} + \phi + \psi)\Prec X^{\!\resizebox{0.6em}{!}{
\begin{tikzpicture}
\pgfpathmoveto{\pgfqpoint{0cm}{0cm}}
\pgfpathlineto{\pgfqpoint{1.376cm}{0cm}}
\pgfpathlineto{\pgfqpoint{1.376cm}{1.588cm}}
\pgfpathlineto{\pgfqpoint{0cm}{1.588cm}}
\pgfpathclose
\pgfusepath{clip}
\begin{pgfscope}
\begin{pgfscope}
\pgfpathmoveto{\pgfqpoint{0cm}{0cm}}
\pgfpathlineto{\pgfqpoint{1.376cm}{0cm}}
\pgfpathlineto{\pgfqpoint{1.376cm}{1.588cm}}
\pgfpathlineto{\pgfqpoint{0cm}{1.588cm}}
\pgfpathclose
\pgfusepath{clip}
\begin{pgfscope}
\begin{pgfscope}
\definecolor{eps2pgf_color}{gray}{0.976471}\pgfsetstrokecolor{eps2pgf_color}\pgfsetfillcolor{eps2pgf_color}
\pgfpathmoveto{\pgfqpoint{0cm}{0cm}}
\pgfpathlineto{\pgfqpoint{1.376cm}{0cm}}
\pgfpathlineto{\pgfqpoint{1.376cm}{1.588cm}}
\pgfpathlineto{\pgfqpoint{0cm}{1.588cm}}
\pgfpathclose
\pgfusepath{fill}
\end{pgfscope}
\begin{pgfscope}
\pgfsetdash{}{0cm}
\pgfsetlinewidth{0.818mm}
\pgfsetroundcap
\pgfsetroundjoin
\pgfsetmiterlimit{7.0}
\definecolor{eps2pgf_color}{gray}{0}\pgfsetstrokecolor{eps2pgf_color}\pgfsetfillcolor{eps2pgf_color}
\pgfpathmoveto{\pgfqpoint{0.117cm}{1.476cm}}
\pgfpathlineto{\pgfqpoint{0.682cm}{0.726cm}}
\pgfpathlineto{\pgfqpoint{1.246cm}{1.476cm}}
\pgfusepath{stroke}
\end{pgfscope}
\definecolor{eps2pgf_color}{gray}{0}\pgfsetstrokecolor{eps2pgf_color}\pgfsetfillcolor{eps2pgf_color}
\pgfpathmoveto{\pgfqpoint{0.273cm}{1.451cm}}
\pgfpathcurveto{\pgfqpoint{0.273cm}{1.487cm}}{\pgfqpoint{0.259cm}{1.522cm}}{\pgfqpoint{0.233cm}{1.547cm}}
\pgfpathcurveto{\pgfqpoint{0.207cm}{1.573cm}}{\pgfqpoint{0.173cm}{1.588cm}}{\pgfqpoint{0.137cm}{1.588cm}}
\pgfpathcurveto{\pgfqpoint{0.1cm}{1.588cm}}{\pgfqpoint{0.066cm}{1.573cm}}{\pgfqpoint{0.04cm}{1.547cm}}
\pgfpathcurveto{\pgfqpoint{0.014cm}{1.522cm}}{\pgfqpoint{0cm}{1.487cm}}{\pgfqpoint{0cm}{1.451cm}}
\pgfpathcurveto{\pgfqpoint{0cm}{1.414cm}}{\pgfqpoint{0.014cm}{1.379cm}}{\pgfqpoint{0.04cm}{1.354cm}}
\pgfpathcurveto{\pgfqpoint{0.066cm}{1.328cm}}{\pgfqpoint{0.1cm}{1.314cm}}{\pgfqpoint{0.137cm}{1.314cm}}
\pgfpathcurveto{\pgfqpoint{0.173cm}{1.314cm}}{\pgfqpoint{0.207cm}{1.328cm}}{\pgfqpoint{0.233cm}{1.354cm}}
\pgfpathcurveto{\pgfqpoint{0.259cm}{1.379cm}}{\pgfqpoint{0.273cm}{1.414cm}}{\pgfqpoint{0.273cm}{1.451cm}}
\pgfusepath{fill}
\pgfpathmoveto{\pgfqpoint{1.345cm}{1.426cm}}
\pgfpathcurveto{\pgfqpoint{1.345cm}{1.463cm}}{\pgfqpoint{1.331cm}{1.497cm}}{\pgfqpoint{1.305cm}{1.523cm}}
\pgfpathcurveto{\pgfqpoint{1.28cm}{1.549cm}}{\pgfqpoint{1.245cm}{1.563cm}}{\pgfqpoint{1.209cm}{1.563cm}}
\pgfpathcurveto{\pgfqpoint{1.172cm}{1.563cm}}{\pgfqpoint{1.138cm}{1.549cm}}{\pgfqpoint{1.112cm}{1.523cm}}
\pgfpathcurveto{\pgfqpoint{1.087cm}{1.497cm}}{\pgfqpoint{1.072cm}{1.463cm}}{\pgfqpoint{1.072cm}{1.426cm}}
\pgfpathcurveto{\pgfqpoint{1.072cm}{1.39cm}}{\pgfqpoint{1.087cm}{1.355cm}}{\pgfqpoint{1.112cm}{1.329cm}}
\pgfpathcurveto{\pgfqpoint{1.138cm}{1.304cm}}{\pgfqpoint{1.172cm}{1.289cm}}{\pgfqpoint{1.209cm}{1.289cm}}
\pgfpathcurveto{\pgfqpoint{1.245cm}{1.289cm}}{\pgfqpoint{1.28cm}{1.304cm}}{\pgfqpoint{1.305cm}{1.329cm}}
\pgfpathcurveto{\pgfqpoint{1.331cm}{1.355cm}}{\pgfqpoint{1.345cm}{1.39cm}}{\pgfqpoint{1.345cm}{1.426cm}}
\pgfusepath{fill}
\begin{pgfscope}
\pgfsetdash{}{0cm}
\pgfsetlinewidth{0.818mm}
\pgfsetroundcap
\pgfsetmiterlimit{4.0}
\pgfpathmoveto{\pgfqpoint{0.682cm}{0.726cm}}
\pgfpathlineto{\pgfqpoint{0.682cm}{0.097cm}}
\pgfusepath{stroke}
\end{pgfscope}
\end{pgfscope}
\end{pgfscope}
\end{pgfscope}
\end{tikzpicture}}}
\end{align}
for some $\vartheta$ which is more regular (we will see below that $\vartheta$ has the regularity $1+\alpha$).

Furthermore, it can be seen in Lemma \ref{lemma:schauder-par} that the expected time regularity for $\psi$ is not optimal. Indeed,  we cannot go beyond $C^{1}$ with respect to time, which would be natural for taking the full advantage of the interpolation (in time) from Lemma \ref{lemma:interp2} and mimicking the strategy of Section \ref{sec:45}. Therefore, the terms requiring time regularity of $\phi+\psi$ have to be treated differently. To this end, it is necessary to consider a higher power of the weight $\rho$ in the bounds for $\vartheta$, which will help us compensate for sub-optimal time interpolation. Therefore, we aim at estimating $\vartheta$ in $C\CC^{1+\alpha}(\rho^{3+\gamma'})$ where $0<\gamma'<\gamma$. This issue  will become even more challenging in the coming down from infinity in Section \ref{s:d}, where no time interpolation is available.

With \eqref{eq:th43} at hand, \eqref{eq:43} rewrites as
\begin{align}\label{eq:rhs43a}
\begin{aligned}
0&= \LL\vartheta+\LL\psi -3\Big( (-X^{\!\resizebox{0.6em}{!}{
\begin{tikzpicture}
\pgfpathmoveto{\pgfqpoint{0cm}{-0.035cm}}
\pgfpathlineto{\pgfqpoint{1.376cm}{-0.035cm}}
\pgfpathlineto{\pgfqpoint{1.376cm}{1.552cm}}
\pgfpathlineto{\pgfqpoint{0cm}{1.552cm}}
\pgfpathclose
\pgfusepath{clip}
\begin{pgfscope}
\begin{pgfscope}
\pgfpathmoveto{\pgfqpoint{0cm}{-0.035cm}}
\pgfpathlineto{\pgfqpoint{1.376cm}{-0.035cm}}
\pgfpathlineto{\pgfqpoint{1.376cm}{1.552cm}}
\pgfpathlineto{\pgfqpoint{0cm}{1.552cm}}
\pgfpathclose
\pgfusepath{clip}
\begin{pgfscope}
\begin{pgfscope}
\pgfsetdash{}{0cm}
\pgfsetlinewidth{0.818mm}
\pgfsetroundcap
\pgfsetroundjoin
\pgfsetmiterlimit{7.0}
\definecolor{eps2pgf_color}{gray}{0}\pgfsetstrokecolor{eps2pgf_color}\pgfsetfillcolor{eps2pgf_color}
\pgfpathmoveto{\pgfqpoint{0.117cm}{1.421cm}}
\pgfpathlineto{\pgfqpoint{0.682cm}{0.671cm}}
\pgfpathlineto{\pgfqpoint{1.246cm}{1.421cm}}
\pgfusepath{stroke}
\end{pgfscope}
\definecolor{eps2pgf_color}{gray}{0}\pgfsetstrokecolor{eps2pgf_color}\pgfsetfillcolor{eps2pgf_color}
\pgfpathmoveto{\pgfqpoint{0.273cm}{1.395cm}}
\pgfpathcurveto{\pgfqpoint{0.273cm}{1.432cm}}{\pgfqpoint{0.259cm}{1.467cm}}{\pgfqpoint{0.233cm}{1.492cm}}
\pgfpathcurveto{\pgfqpoint{0.207cm}{1.518cm}}{\pgfqpoint{0.173cm}{1.532cm}}{\pgfqpoint{0.137cm}{1.532cm}}
\pgfpathcurveto{\pgfqpoint{0.1cm}{1.532cm}}{\pgfqpoint{0.066cm}{1.518cm}}{\pgfqpoint{0.04cm}{1.492cm}}
\pgfpathcurveto{\pgfqpoint{0.014cm}{1.467cm}}{\pgfqpoint{0cm}{1.432cm}}{\pgfqpoint{0cm}{1.395cm}}
\pgfpathcurveto{\pgfqpoint{0cm}{1.359cm}}{\pgfqpoint{0.014cm}{1.324cm}}{\pgfqpoint{0.04cm}{1.299cm}}
\pgfpathcurveto{\pgfqpoint{0.066cm}{1.273cm}}{\pgfqpoint{0.1cm}{1.258cm}}{\pgfqpoint{0.137cm}{1.258cm}}
\pgfpathcurveto{\pgfqpoint{0.173cm}{1.258cm}}{\pgfqpoint{0.207cm}{1.273cm}}{\pgfqpoint{0.233cm}{1.299cm}}
\pgfpathcurveto{\pgfqpoint{0.259cm}{1.324cm}}{\pgfqpoint{0.273cm}{1.359cm}}{\pgfqpoint{0.273cm}{1.395cm}}
\pgfusepath{fill}
\begin{pgfscope}
\pgfsetdash{}{0cm}
\pgfsetlinewidth{0.818mm}
\pgfsetmiterlimit{7.0}
\pgfpathmoveto{\pgfqpoint{0.682cm}{0.671cm}}
\pgfpathlineto{\pgfqpoint{0.679cm}{1.418cm}}
\pgfusepath{stroke}
\end{pgfscope}
\pgfpathmoveto{\pgfqpoint{0.815cm}{1.399cm}}
\pgfpathcurveto{\pgfqpoint{0.815cm}{1.435cm}}{\pgfqpoint{0.801cm}{1.47cm}}{\pgfqpoint{0.775cm}{1.496cm}}
\pgfpathcurveto{\pgfqpoint{0.75cm}{1.521cm}}{\pgfqpoint{0.715cm}{1.536cm}}{\pgfqpoint{0.679cm}{1.536cm}}
\pgfpathcurveto{\pgfqpoint{0.643cm}{1.536cm}}{\pgfqpoint{0.608cm}{1.521cm}}{\pgfqpoint{0.582cm}{1.496cm}}
\pgfpathcurveto{\pgfqpoint{0.557cm}{1.47cm}}{\pgfqpoint{0.542cm}{1.435cm}}{\pgfqpoint{0.542cm}{1.399cm}}
\pgfpathcurveto{\pgfqpoint{0.542cm}{1.363cm}}{\pgfqpoint{0.557cm}{1.328cm}}{\pgfqpoint{0.582cm}{1.302cm}}
\pgfpathcurveto{\pgfqpoint{0.608cm}{1.276cm}}{\pgfqpoint{0.643cm}{1.262cm}}{\pgfqpoint{0.679cm}{1.262cm}}
\pgfpathcurveto{\pgfqpoint{0.715cm}{1.262cm}}{\pgfqpoint{0.75cm}{1.276cm}}{\pgfqpoint{0.775cm}{1.302cm}}
\pgfpathcurveto{\pgfqpoint{0.801cm}{1.328cm}}{\pgfqpoint{0.815cm}{1.363cm}}{\pgfqpoint{0.815cm}{1.399cm}}
\pgfusepath{fill}
\pgfpathmoveto{\pgfqpoint{1.345cm}{1.371cm}}
\pgfpathcurveto{\pgfqpoint{1.345cm}{1.408cm}}{\pgfqpoint{1.331cm}{1.442cm}}{\pgfqpoint{1.305cm}{1.468cm}}
\pgfpathcurveto{\pgfqpoint{1.28cm}{1.494cm}}{\pgfqpoint{1.245cm}{1.508cm}}{\pgfqpoint{1.209cm}{1.508cm}}
\pgfpathcurveto{\pgfqpoint{1.172cm}{1.508cm}}{\pgfqpoint{1.138cm}{1.494cm}}{\pgfqpoint{1.112cm}{1.468cm}}
\pgfpathcurveto{\pgfqpoint{1.087cm}{1.442cm}}{\pgfqpoint{1.072cm}{1.408cm}}{\pgfqpoint{1.072cm}{1.371cm}}
\pgfpathcurveto{\pgfqpoint{1.072cm}{1.335cm}}{\pgfqpoint{1.087cm}{1.3cm}}{\pgfqpoint{1.112cm}{1.274cm}}
\pgfpathcurveto{\pgfqpoint{1.138cm}{1.249cm}}{\pgfqpoint{1.172cm}{1.234cm}}{\pgfqpoint{1.209cm}{1.234cm}}
\pgfpathcurveto{\pgfqpoint{1.245cm}{1.234cm}}{\pgfqpoint{1.28cm}{1.249cm}}{\pgfqpoint{1.305cm}{1.274cm}}
\pgfpathcurveto{\pgfqpoint{1.331cm}{1.3cm}}{\pgfqpoint{1.345cm}{1.335cm}}{\pgfqpoint{1.345cm}{1.371cm}}
\pgfusepath{fill}
\begin{pgfscope}
\pgfsetdash{}{0cm}
\pgfsetlinewidth{0.818mm}
\pgfsetroundcap
\pgfsetmiterlimit{4.0}
\pgfpathmoveto{\pgfqpoint{0.682cm}{0.671cm}}
\pgfpathlineto{\pgfqpoint{0.682cm}{0.042cm}}
\pgfusepath{stroke}
\end{pgfscope}
\end{pgfscope}
\end{pgfscope}
\end{pgfscope}
\end{tikzpicture}}}+\phi+\psi)\Prec\llbracket X^{2}\rrbracket- (-X^{\!\resizebox{0.6em}{!}{
\begin{tikzpicture}
\pgfpathmoveto{\pgfqpoint{0cm}{-0.035cm}}
\pgfpathlineto{\pgfqpoint{1.376cm}{-0.035cm}}
\pgfpathlineto{\pgfqpoint{1.376cm}{1.552cm}}
\pgfpathlineto{\pgfqpoint{0cm}{1.552cm}}
\pgfpathclose
\pgfusepath{clip}
\begin{pgfscope}
\begin{pgfscope}
\pgfpathmoveto{\pgfqpoint{0cm}{-0.035cm}}
\pgfpathlineto{\pgfqpoint{1.376cm}{-0.035cm}}
\pgfpathlineto{\pgfqpoint{1.376cm}{1.552cm}}
\pgfpathlineto{\pgfqpoint{0cm}{1.552cm}}
\pgfpathclose
\pgfusepath{clip}
\begin{pgfscope}
\begin{pgfscope}
\pgfsetdash{}{0cm}
\pgfsetlinewidth{0.818mm}
\pgfsetroundcap
\pgfsetroundjoin
\pgfsetmiterlimit{7.0}
\definecolor{eps2pgf_color}{gray}{0}\pgfsetstrokecolor{eps2pgf_color}\pgfsetfillcolor{eps2pgf_color}
\pgfpathmoveto{\pgfqpoint{0.117cm}{1.421cm}}
\pgfpathlineto{\pgfqpoint{0.682cm}{0.671cm}}
\pgfpathlineto{\pgfqpoint{1.246cm}{1.421cm}}
\pgfusepath{stroke}
\end{pgfscope}
\definecolor{eps2pgf_color}{gray}{0}\pgfsetstrokecolor{eps2pgf_color}\pgfsetfillcolor{eps2pgf_color}
\pgfpathmoveto{\pgfqpoint{0.273cm}{1.395cm}}
\pgfpathcurveto{\pgfqpoint{0.273cm}{1.432cm}}{\pgfqpoint{0.259cm}{1.467cm}}{\pgfqpoint{0.233cm}{1.492cm}}
\pgfpathcurveto{\pgfqpoint{0.207cm}{1.518cm}}{\pgfqpoint{0.173cm}{1.532cm}}{\pgfqpoint{0.137cm}{1.532cm}}
\pgfpathcurveto{\pgfqpoint{0.1cm}{1.532cm}}{\pgfqpoint{0.066cm}{1.518cm}}{\pgfqpoint{0.04cm}{1.492cm}}
\pgfpathcurveto{\pgfqpoint{0.014cm}{1.467cm}}{\pgfqpoint{0cm}{1.432cm}}{\pgfqpoint{0cm}{1.395cm}}
\pgfpathcurveto{\pgfqpoint{0cm}{1.359cm}}{\pgfqpoint{0.014cm}{1.324cm}}{\pgfqpoint{0.04cm}{1.299cm}}
\pgfpathcurveto{\pgfqpoint{0.066cm}{1.273cm}}{\pgfqpoint{0.1cm}{1.258cm}}{\pgfqpoint{0.137cm}{1.258cm}}
\pgfpathcurveto{\pgfqpoint{0.173cm}{1.258cm}}{\pgfqpoint{0.207cm}{1.273cm}}{\pgfqpoint{0.233cm}{1.299cm}}
\pgfpathcurveto{\pgfqpoint{0.259cm}{1.324cm}}{\pgfqpoint{0.273cm}{1.359cm}}{\pgfqpoint{0.273cm}{1.395cm}}
\pgfusepath{fill}
\begin{pgfscope}
\pgfsetdash{}{0cm}
\pgfsetlinewidth{0.818mm}
\pgfsetmiterlimit{7.0}
\pgfpathmoveto{\pgfqpoint{0.682cm}{0.671cm}}
\pgfpathlineto{\pgfqpoint{0.679cm}{1.418cm}}
\pgfusepath{stroke}
\end{pgfscope}
\pgfpathmoveto{\pgfqpoint{0.815cm}{1.399cm}}
\pgfpathcurveto{\pgfqpoint{0.815cm}{1.435cm}}{\pgfqpoint{0.801cm}{1.47cm}}{\pgfqpoint{0.775cm}{1.496cm}}
\pgfpathcurveto{\pgfqpoint{0.75cm}{1.521cm}}{\pgfqpoint{0.715cm}{1.536cm}}{\pgfqpoint{0.679cm}{1.536cm}}
\pgfpathcurveto{\pgfqpoint{0.643cm}{1.536cm}}{\pgfqpoint{0.608cm}{1.521cm}}{\pgfqpoint{0.582cm}{1.496cm}}
\pgfpathcurveto{\pgfqpoint{0.557cm}{1.47cm}}{\pgfqpoint{0.542cm}{1.435cm}}{\pgfqpoint{0.542cm}{1.399cm}}
\pgfpathcurveto{\pgfqpoint{0.542cm}{1.363cm}}{\pgfqpoint{0.557cm}{1.328cm}}{\pgfqpoint{0.582cm}{1.302cm}}
\pgfpathcurveto{\pgfqpoint{0.608cm}{1.276cm}}{\pgfqpoint{0.643cm}{1.262cm}}{\pgfqpoint{0.679cm}{1.262cm}}
\pgfpathcurveto{\pgfqpoint{0.715cm}{1.262cm}}{\pgfqpoint{0.75cm}{1.276cm}}{\pgfqpoint{0.775cm}{1.302cm}}
\pgfpathcurveto{\pgfqpoint{0.801cm}{1.328cm}}{\pgfqpoint{0.815cm}{1.363cm}}{\pgfqpoint{0.815cm}{1.399cm}}
\pgfusepath{fill}
\pgfpathmoveto{\pgfqpoint{1.345cm}{1.371cm}}
\pgfpathcurveto{\pgfqpoint{1.345cm}{1.408cm}}{\pgfqpoint{1.331cm}{1.442cm}}{\pgfqpoint{1.305cm}{1.468cm}}
\pgfpathcurveto{\pgfqpoint{1.28cm}{1.494cm}}{\pgfqpoint{1.245cm}{1.508cm}}{\pgfqpoint{1.209cm}{1.508cm}}
\pgfpathcurveto{\pgfqpoint{1.172cm}{1.508cm}}{\pgfqpoint{1.138cm}{1.494cm}}{\pgfqpoint{1.112cm}{1.468cm}}
\pgfpathcurveto{\pgfqpoint{1.087cm}{1.442cm}}{\pgfqpoint{1.072cm}{1.408cm}}{\pgfqpoint{1.072cm}{1.371cm}}
\pgfpathcurveto{\pgfqpoint{1.072cm}{1.335cm}}{\pgfqpoint{1.087cm}{1.3cm}}{\pgfqpoint{1.112cm}{1.274cm}}
\pgfpathcurveto{\pgfqpoint{1.138cm}{1.249cm}}{\pgfqpoint{1.172cm}{1.234cm}}{\pgfqpoint{1.209cm}{1.234cm}}
\pgfpathcurveto{\pgfqpoint{1.245cm}{1.234cm}}{\pgfqpoint{1.28cm}{1.249cm}}{\pgfqpoint{1.305cm}{1.274cm}}
\pgfpathcurveto{\pgfqpoint{1.331cm}{1.3cm}}{\pgfqpoint{1.345cm}{1.335cm}}{\pgfqpoint{1.345cm}{1.371cm}}
\pgfusepath{fill}
\begin{pgfscope}
\pgfsetdash{}{0cm}
\pgfsetlinewidth{0.818mm}
\pgfsetroundcap
\pgfsetmiterlimit{4.0}
\pgfpathmoveto{\pgfqpoint{0.682cm}{0.671cm}}
\pgfpathlineto{\pgfqpoint{0.682cm}{0.042cm}}
\pgfusepath{stroke}
\end{pgfscope}
\end{pgfscope}
\end{pgfscope}
\end{pgfscope}
\end{tikzpicture}}}+\phi+\psi)\prec\llbracket X^{2}\rrbracket\Big)\\
&\quad+3\llbracket X^2 \rrbracket\preccurlyeq(-X^{\!\resizebox{0.6em}{!}{
\begin{tikzpicture}
\pgfpathmoveto{\pgfqpoint{0cm}{-0.035cm}}
\pgfpathlineto{\pgfqpoint{1.376cm}{-0.035cm}}
\pgfpathlineto{\pgfqpoint{1.376cm}{1.552cm}}
\pgfpathlineto{\pgfqpoint{0cm}{1.552cm}}
\pgfpathclose
\pgfusepath{clip}
\begin{pgfscope}
\begin{pgfscope}
\pgfpathmoveto{\pgfqpoint{0cm}{-0.035cm}}
\pgfpathlineto{\pgfqpoint{1.376cm}{-0.035cm}}
\pgfpathlineto{\pgfqpoint{1.376cm}{1.552cm}}
\pgfpathlineto{\pgfqpoint{0cm}{1.552cm}}
\pgfpathclose
\pgfusepath{clip}
\begin{pgfscope}
\begin{pgfscope}
\pgfsetdash{}{0cm}
\pgfsetlinewidth{0.818mm}
\pgfsetroundcap
\pgfsetroundjoin
\pgfsetmiterlimit{7.0}
\definecolor{eps2pgf_color}{gray}{0}\pgfsetstrokecolor{eps2pgf_color}\pgfsetfillcolor{eps2pgf_color}
\pgfpathmoveto{\pgfqpoint{0.117cm}{1.421cm}}
\pgfpathlineto{\pgfqpoint{0.682cm}{0.671cm}}
\pgfpathlineto{\pgfqpoint{1.246cm}{1.421cm}}
\pgfusepath{stroke}
\end{pgfscope}
\definecolor{eps2pgf_color}{gray}{0}\pgfsetstrokecolor{eps2pgf_color}\pgfsetfillcolor{eps2pgf_color}
\pgfpathmoveto{\pgfqpoint{0.273cm}{1.395cm}}
\pgfpathcurveto{\pgfqpoint{0.273cm}{1.432cm}}{\pgfqpoint{0.259cm}{1.467cm}}{\pgfqpoint{0.233cm}{1.492cm}}
\pgfpathcurveto{\pgfqpoint{0.207cm}{1.518cm}}{\pgfqpoint{0.173cm}{1.532cm}}{\pgfqpoint{0.137cm}{1.532cm}}
\pgfpathcurveto{\pgfqpoint{0.1cm}{1.532cm}}{\pgfqpoint{0.066cm}{1.518cm}}{\pgfqpoint{0.04cm}{1.492cm}}
\pgfpathcurveto{\pgfqpoint{0.014cm}{1.467cm}}{\pgfqpoint{0cm}{1.432cm}}{\pgfqpoint{0cm}{1.395cm}}
\pgfpathcurveto{\pgfqpoint{0cm}{1.359cm}}{\pgfqpoint{0.014cm}{1.324cm}}{\pgfqpoint{0.04cm}{1.299cm}}
\pgfpathcurveto{\pgfqpoint{0.066cm}{1.273cm}}{\pgfqpoint{0.1cm}{1.258cm}}{\pgfqpoint{0.137cm}{1.258cm}}
\pgfpathcurveto{\pgfqpoint{0.173cm}{1.258cm}}{\pgfqpoint{0.207cm}{1.273cm}}{\pgfqpoint{0.233cm}{1.299cm}}
\pgfpathcurveto{\pgfqpoint{0.259cm}{1.324cm}}{\pgfqpoint{0.273cm}{1.359cm}}{\pgfqpoint{0.273cm}{1.395cm}}
\pgfusepath{fill}
\begin{pgfscope}
\pgfsetdash{}{0cm}
\pgfsetlinewidth{0.818mm}
\pgfsetmiterlimit{7.0}
\pgfpathmoveto{\pgfqpoint{0.682cm}{0.671cm}}
\pgfpathlineto{\pgfqpoint{0.679cm}{1.418cm}}
\pgfusepath{stroke}
\end{pgfscope}
\pgfpathmoveto{\pgfqpoint{0.815cm}{1.399cm}}
\pgfpathcurveto{\pgfqpoint{0.815cm}{1.435cm}}{\pgfqpoint{0.801cm}{1.47cm}}{\pgfqpoint{0.775cm}{1.496cm}}
\pgfpathcurveto{\pgfqpoint{0.75cm}{1.521cm}}{\pgfqpoint{0.715cm}{1.536cm}}{\pgfqpoint{0.679cm}{1.536cm}}
\pgfpathcurveto{\pgfqpoint{0.643cm}{1.536cm}}{\pgfqpoint{0.608cm}{1.521cm}}{\pgfqpoint{0.582cm}{1.496cm}}
\pgfpathcurveto{\pgfqpoint{0.557cm}{1.47cm}}{\pgfqpoint{0.542cm}{1.435cm}}{\pgfqpoint{0.542cm}{1.399cm}}
\pgfpathcurveto{\pgfqpoint{0.542cm}{1.363cm}}{\pgfqpoint{0.557cm}{1.328cm}}{\pgfqpoint{0.582cm}{1.302cm}}
\pgfpathcurveto{\pgfqpoint{0.608cm}{1.276cm}}{\pgfqpoint{0.643cm}{1.262cm}}{\pgfqpoint{0.679cm}{1.262cm}}
\pgfpathcurveto{\pgfqpoint{0.715cm}{1.262cm}}{\pgfqpoint{0.75cm}{1.276cm}}{\pgfqpoint{0.775cm}{1.302cm}}
\pgfpathcurveto{\pgfqpoint{0.801cm}{1.328cm}}{\pgfqpoint{0.815cm}{1.363cm}}{\pgfqpoint{0.815cm}{1.399cm}}
\pgfusepath{fill}
\pgfpathmoveto{\pgfqpoint{1.345cm}{1.371cm}}
\pgfpathcurveto{\pgfqpoint{1.345cm}{1.408cm}}{\pgfqpoint{1.331cm}{1.442cm}}{\pgfqpoint{1.305cm}{1.468cm}}
\pgfpathcurveto{\pgfqpoint{1.28cm}{1.494cm}}{\pgfqpoint{1.245cm}{1.508cm}}{\pgfqpoint{1.209cm}{1.508cm}}
\pgfpathcurveto{\pgfqpoint{1.172cm}{1.508cm}}{\pgfqpoint{1.138cm}{1.494cm}}{\pgfqpoint{1.112cm}{1.468cm}}
\pgfpathcurveto{\pgfqpoint{1.087cm}{1.442cm}}{\pgfqpoint{1.072cm}{1.408cm}}{\pgfqpoint{1.072cm}{1.371cm}}
\pgfpathcurveto{\pgfqpoint{1.072cm}{1.335cm}}{\pgfqpoint{1.087cm}{1.3cm}}{\pgfqpoint{1.112cm}{1.274cm}}
\pgfpathcurveto{\pgfqpoint{1.138cm}{1.249cm}}{\pgfqpoint{1.172cm}{1.234cm}}{\pgfqpoint{1.209cm}{1.234cm}}
\pgfpathcurveto{\pgfqpoint{1.245cm}{1.234cm}}{\pgfqpoint{1.28cm}{1.249cm}}{\pgfqpoint{1.305cm}{1.274cm}}
\pgfpathcurveto{\pgfqpoint{1.331cm}{1.3cm}}{\pgfqpoint{1.345cm}{1.335cm}}{\pgfqpoint{1.345cm}{1.371cm}}
\pgfusepath{fill}
\begin{pgfscope}
\pgfsetdash{}{0cm}
\pgfsetlinewidth{0.818mm}
\pgfsetroundcap
\pgfsetmiterlimit{4.0}
\pgfpathmoveto{\pgfqpoint{0.682cm}{0.671cm}}
\pgfpathlineto{\pgfqpoint{0.682cm}{0.042cm}}
\pgfusepath{stroke}
\end{pgfscope}
\end{pgfscope}
\end{pgfscope}
\end{pgfscope}
\end{tikzpicture}}}+\phi+\psi)-3[\LL,(-X^{\!\resizebox{0.6em}{!}{
\begin{tikzpicture}
\pgfpathmoveto{\pgfqpoint{0cm}{-0.035cm}}
\pgfpathlineto{\pgfqpoint{1.376cm}{-0.035cm}}
\pgfpathlineto{\pgfqpoint{1.376cm}{1.552cm}}
\pgfpathlineto{\pgfqpoint{0cm}{1.552cm}}
\pgfpathclose
\pgfusepath{clip}
\begin{pgfscope}
\begin{pgfscope}
\pgfpathmoveto{\pgfqpoint{0cm}{-0.035cm}}
\pgfpathlineto{\pgfqpoint{1.376cm}{-0.035cm}}
\pgfpathlineto{\pgfqpoint{1.376cm}{1.552cm}}
\pgfpathlineto{\pgfqpoint{0cm}{1.552cm}}
\pgfpathclose
\pgfusepath{clip}
\begin{pgfscope}
\begin{pgfscope}
\pgfsetdash{}{0cm}
\pgfsetlinewidth{0.818mm}
\pgfsetroundcap
\pgfsetroundjoin
\pgfsetmiterlimit{7.0}
\definecolor{eps2pgf_color}{gray}{0}\pgfsetstrokecolor{eps2pgf_color}\pgfsetfillcolor{eps2pgf_color}
\pgfpathmoveto{\pgfqpoint{0.117cm}{1.421cm}}
\pgfpathlineto{\pgfqpoint{0.682cm}{0.671cm}}
\pgfpathlineto{\pgfqpoint{1.246cm}{1.421cm}}
\pgfusepath{stroke}
\end{pgfscope}
\definecolor{eps2pgf_color}{gray}{0}\pgfsetstrokecolor{eps2pgf_color}\pgfsetfillcolor{eps2pgf_color}
\pgfpathmoveto{\pgfqpoint{0.273cm}{1.395cm}}
\pgfpathcurveto{\pgfqpoint{0.273cm}{1.432cm}}{\pgfqpoint{0.259cm}{1.467cm}}{\pgfqpoint{0.233cm}{1.492cm}}
\pgfpathcurveto{\pgfqpoint{0.207cm}{1.518cm}}{\pgfqpoint{0.173cm}{1.532cm}}{\pgfqpoint{0.137cm}{1.532cm}}
\pgfpathcurveto{\pgfqpoint{0.1cm}{1.532cm}}{\pgfqpoint{0.066cm}{1.518cm}}{\pgfqpoint{0.04cm}{1.492cm}}
\pgfpathcurveto{\pgfqpoint{0.014cm}{1.467cm}}{\pgfqpoint{0cm}{1.432cm}}{\pgfqpoint{0cm}{1.395cm}}
\pgfpathcurveto{\pgfqpoint{0cm}{1.359cm}}{\pgfqpoint{0.014cm}{1.324cm}}{\pgfqpoint{0.04cm}{1.299cm}}
\pgfpathcurveto{\pgfqpoint{0.066cm}{1.273cm}}{\pgfqpoint{0.1cm}{1.258cm}}{\pgfqpoint{0.137cm}{1.258cm}}
\pgfpathcurveto{\pgfqpoint{0.173cm}{1.258cm}}{\pgfqpoint{0.207cm}{1.273cm}}{\pgfqpoint{0.233cm}{1.299cm}}
\pgfpathcurveto{\pgfqpoint{0.259cm}{1.324cm}}{\pgfqpoint{0.273cm}{1.359cm}}{\pgfqpoint{0.273cm}{1.395cm}}
\pgfusepath{fill}
\begin{pgfscope}
\pgfsetdash{}{0cm}
\pgfsetlinewidth{0.818mm}
\pgfsetmiterlimit{7.0}
\pgfpathmoveto{\pgfqpoint{0.682cm}{0.671cm}}
\pgfpathlineto{\pgfqpoint{0.679cm}{1.418cm}}
\pgfusepath{stroke}
\end{pgfscope}
\pgfpathmoveto{\pgfqpoint{0.815cm}{1.399cm}}
\pgfpathcurveto{\pgfqpoint{0.815cm}{1.435cm}}{\pgfqpoint{0.801cm}{1.47cm}}{\pgfqpoint{0.775cm}{1.496cm}}
\pgfpathcurveto{\pgfqpoint{0.75cm}{1.521cm}}{\pgfqpoint{0.715cm}{1.536cm}}{\pgfqpoint{0.679cm}{1.536cm}}
\pgfpathcurveto{\pgfqpoint{0.643cm}{1.536cm}}{\pgfqpoint{0.608cm}{1.521cm}}{\pgfqpoint{0.582cm}{1.496cm}}
\pgfpathcurveto{\pgfqpoint{0.557cm}{1.47cm}}{\pgfqpoint{0.542cm}{1.435cm}}{\pgfqpoint{0.542cm}{1.399cm}}
\pgfpathcurveto{\pgfqpoint{0.542cm}{1.363cm}}{\pgfqpoint{0.557cm}{1.328cm}}{\pgfqpoint{0.582cm}{1.302cm}}
\pgfpathcurveto{\pgfqpoint{0.608cm}{1.276cm}}{\pgfqpoint{0.643cm}{1.262cm}}{\pgfqpoint{0.679cm}{1.262cm}}
\pgfpathcurveto{\pgfqpoint{0.715cm}{1.262cm}}{\pgfqpoint{0.75cm}{1.276cm}}{\pgfqpoint{0.775cm}{1.302cm}}
\pgfpathcurveto{\pgfqpoint{0.801cm}{1.328cm}}{\pgfqpoint{0.815cm}{1.363cm}}{\pgfqpoint{0.815cm}{1.399cm}}
\pgfusepath{fill}
\pgfpathmoveto{\pgfqpoint{1.345cm}{1.371cm}}
\pgfpathcurveto{\pgfqpoint{1.345cm}{1.408cm}}{\pgfqpoint{1.331cm}{1.442cm}}{\pgfqpoint{1.305cm}{1.468cm}}
\pgfpathcurveto{\pgfqpoint{1.28cm}{1.494cm}}{\pgfqpoint{1.245cm}{1.508cm}}{\pgfqpoint{1.209cm}{1.508cm}}
\pgfpathcurveto{\pgfqpoint{1.172cm}{1.508cm}}{\pgfqpoint{1.138cm}{1.494cm}}{\pgfqpoint{1.112cm}{1.468cm}}
\pgfpathcurveto{\pgfqpoint{1.087cm}{1.442cm}}{\pgfqpoint{1.072cm}{1.408cm}}{\pgfqpoint{1.072cm}{1.371cm}}
\pgfpathcurveto{\pgfqpoint{1.072cm}{1.335cm}}{\pgfqpoint{1.087cm}{1.3cm}}{\pgfqpoint{1.112cm}{1.274cm}}
\pgfpathcurveto{\pgfqpoint{1.138cm}{1.249cm}}{\pgfqpoint{1.172cm}{1.234cm}}{\pgfqpoint{1.209cm}{1.234cm}}
\pgfpathcurveto{\pgfqpoint{1.245cm}{1.234cm}}{\pgfqpoint{1.28cm}{1.249cm}}{\pgfqpoint{1.305cm}{1.274cm}}
\pgfpathcurveto{\pgfqpoint{1.331cm}{1.3cm}}{\pgfqpoint{1.345cm}{1.335cm}}{\pgfqpoint{1.345cm}{1.371cm}}
\pgfusepath{fill}
\begin{pgfscope}
\pgfsetdash{}{0cm}
\pgfsetlinewidth{0.818mm}
\pgfsetroundcap
\pgfsetmiterlimit{4.0}
\pgfpathmoveto{\pgfqpoint{0.682cm}{0.671cm}}
\pgfpathlineto{\pgfqpoint{0.682cm}{0.042cm}}
\pgfusepath{stroke}
\end{pgfscope}
\end{pgfscope}
\end{pgfscope}
\end{pgfscope}
\end{tikzpicture}}}+\phi+\psi)\Prec]X^{\!\resizebox{0.6em}{!}{
\begin{tikzpicture}
\pgfpathmoveto{\pgfqpoint{0cm}{0cm}}
\pgfpathlineto{\pgfqpoint{1.376cm}{0cm}}
\pgfpathlineto{\pgfqpoint{1.376cm}{1.588cm}}
\pgfpathlineto{\pgfqpoint{0cm}{1.588cm}}
\pgfpathclose
\pgfusepath{clip}
\begin{pgfscope}
\begin{pgfscope}
\pgfpathmoveto{\pgfqpoint{0cm}{0cm}}
\pgfpathlineto{\pgfqpoint{1.376cm}{0cm}}
\pgfpathlineto{\pgfqpoint{1.376cm}{1.588cm}}
\pgfpathlineto{\pgfqpoint{0cm}{1.588cm}}
\pgfpathclose
\pgfusepath{clip}
\begin{pgfscope}
\begin{pgfscope}
\definecolor{eps2pgf_color}{gray}{0.976471}\pgfsetstrokecolor{eps2pgf_color}\pgfsetfillcolor{eps2pgf_color}
\pgfpathmoveto{\pgfqpoint{0cm}{0cm}}
\pgfpathlineto{\pgfqpoint{1.376cm}{0cm}}
\pgfpathlineto{\pgfqpoint{1.376cm}{1.588cm}}
\pgfpathlineto{\pgfqpoint{0cm}{1.588cm}}
\pgfpathclose
\pgfusepath{fill}
\end{pgfscope}
\begin{pgfscope}
\pgfsetdash{}{0cm}
\pgfsetlinewidth{0.818mm}
\pgfsetroundcap
\pgfsetroundjoin
\pgfsetmiterlimit{7.0}
\definecolor{eps2pgf_color}{gray}{0}\pgfsetstrokecolor{eps2pgf_color}\pgfsetfillcolor{eps2pgf_color}
\pgfpathmoveto{\pgfqpoint{0.117cm}{1.476cm}}
\pgfpathlineto{\pgfqpoint{0.682cm}{0.726cm}}
\pgfpathlineto{\pgfqpoint{1.246cm}{1.476cm}}
\pgfusepath{stroke}
\end{pgfscope}
\definecolor{eps2pgf_color}{gray}{0}\pgfsetstrokecolor{eps2pgf_color}\pgfsetfillcolor{eps2pgf_color}
\pgfpathmoveto{\pgfqpoint{0.273cm}{1.451cm}}
\pgfpathcurveto{\pgfqpoint{0.273cm}{1.487cm}}{\pgfqpoint{0.259cm}{1.522cm}}{\pgfqpoint{0.233cm}{1.547cm}}
\pgfpathcurveto{\pgfqpoint{0.207cm}{1.573cm}}{\pgfqpoint{0.173cm}{1.588cm}}{\pgfqpoint{0.137cm}{1.588cm}}
\pgfpathcurveto{\pgfqpoint{0.1cm}{1.588cm}}{\pgfqpoint{0.066cm}{1.573cm}}{\pgfqpoint{0.04cm}{1.547cm}}
\pgfpathcurveto{\pgfqpoint{0.014cm}{1.522cm}}{\pgfqpoint{0cm}{1.487cm}}{\pgfqpoint{0cm}{1.451cm}}
\pgfpathcurveto{\pgfqpoint{0cm}{1.414cm}}{\pgfqpoint{0.014cm}{1.379cm}}{\pgfqpoint{0.04cm}{1.354cm}}
\pgfpathcurveto{\pgfqpoint{0.066cm}{1.328cm}}{\pgfqpoint{0.1cm}{1.314cm}}{\pgfqpoint{0.137cm}{1.314cm}}
\pgfpathcurveto{\pgfqpoint{0.173cm}{1.314cm}}{\pgfqpoint{0.207cm}{1.328cm}}{\pgfqpoint{0.233cm}{1.354cm}}
\pgfpathcurveto{\pgfqpoint{0.259cm}{1.379cm}}{\pgfqpoint{0.273cm}{1.414cm}}{\pgfqpoint{0.273cm}{1.451cm}}
\pgfusepath{fill}
\pgfpathmoveto{\pgfqpoint{1.345cm}{1.426cm}}
\pgfpathcurveto{\pgfqpoint{1.345cm}{1.463cm}}{\pgfqpoint{1.331cm}{1.497cm}}{\pgfqpoint{1.305cm}{1.523cm}}
\pgfpathcurveto{\pgfqpoint{1.28cm}{1.549cm}}{\pgfqpoint{1.245cm}{1.563cm}}{\pgfqpoint{1.209cm}{1.563cm}}
\pgfpathcurveto{\pgfqpoint{1.172cm}{1.563cm}}{\pgfqpoint{1.138cm}{1.549cm}}{\pgfqpoint{1.112cm}{1.523cm}}
\pgfpathcurveto{\pgfqpoint{1.087cm}{1.497cm}}{\pgfqpoint{1.072cm}{1.463cm}}{\pgfqpoint{1.072cm}{1.426cm}}
\pgfpathcurveto{\pgfqpoint{1.072cm}{1.39cm}}{\pgfqpoint{1.087cm}{1.355cm}}{\pgfqpoint{1.112cm}{1.329cm}}
\pgfpathcurveto{\pgfqpoint{1.138cm}{1.304cm}}{\pgfqpoint{1.172cm}{1.289cm}}{\pgfqpoint{1.209cm}{1.289cm}}
\pgfpathcurveto{\pgfqpoint{1.245cm}{1.289cm}}{\pgfqpoint{1.28cm}{1.304cm}}{\pgfqpoint{1.305cm}{1.329cm}}
\pgfpathcurveto{\pgfqpoint{1.331cm}{1.355cm}}{\pgfqpoint{1.345cm}{1.39cm}}{\pgfqpoint{1.345cm}{1.426cm}}
\pgfusepath{fill}
\begin{pgfscope}
\pgfsetdash{}{0cm}
\pgfsetlinewidth{0.818mm}
\pgfsetroundcap
\pgfsetmiterlimit{4.0}
\pgfpathmoveto{\pgfqpoint{0.682cm}{0.726cm}}
\pgfpathlineto{\pgfqpoint{0.682cm}{0.097cm}}
\pgfusepath{stroke}
\end{pgfscope}
\end{pgfscope}
\end{pgfscope}
\end{pgfscope}
\end{tikzpicture}}}\\
&\quad+3X(-X^{\!\resizebox{0.6em}{!}{
\begin{tikzpicture}
\pgfpathmoveto{\pgfqpoint{0cm}{-0.035cm}}
\pgfpathlineto{\pgfqpoint{1.376cm}{-0.035cm}}
\pgfpathlineto{\pgfqpoint{1.376cm}{1.552cm}}
\pgfpathlineto{\pgfqpoint{0cm}{1.552cm}}
\pgfpathclose
\pgfusepath{clip}
\begin{pgfscope}
\begin{pgfscope}
\pgfpathmoveto{\pgfqpoint{0cm}{-0.035cm}}
\pgfpathlineto{\pgfqpoint{1.376cm}{-0.035cm}}
\pgfpathlineto{\pgfqpoint{1.376cm}{1.552cm}}
\pgfpathlineto{\pgfqpoint{0cm}{1.552cm}}
\pgfpathclose
\pgfusepath{clip}
\begin{pgfscope}
\begin{pgfscope}
\pgfsetdash{}{0cm}
\pgfsetlinewidth{0.818mm}
\pgfsetroundcap
\pgfsetroundjoin
\pgfsetmiterlimit{7.0}
\definecolor{eps2pgf_color}{gray}{0}\pgfsetstrokecolor{eps2pgf_color}\pgfsetfillcolor{eps2pgf_color}
\pgfpathmoveto{\pgfqpoint{0.117cm}{1.421cm}}
\pgfpathlineto{\pgfqpoint{0.682cm}{0.671cm}}
\pgfpathlineto{\pgfqpoint{1.246cm}{1.421cm}}
\pgfusepath{stroke}
\end{pgfscope}
\definecolor{eps2pgf_color}{gray}{0}\pgfsetstrokecolor{eps2pgf_color}\pgfsetfillcolor{eps2pgf_color}
\pgfpathmoveto{\pgfqpoint{0.273cm}{1.395cm}}
\pgfpathcurveto{\pgfqpoint{0.273cm}{1.432cm}}{\pgfqpoint{0.259cm}{1.467cm}}{\pgfqpoint{0.233cm}{1.492cm}}
\pgfpathcurveto{\pgfqpoint{0.207cm}{1.518cm}}{\pgfqpoint{0.173cm}{1.532cm}}{\pgfqpoint{0.137cm}{1.532cm}}
\pgfpathcurveto{\pgfqpoint{0.1cm}{1.532cm}}{\pgfqpoint{0.066cm}{1.518cm}}{\pgfqpoint{0.04cm}{1.492cm}}
\pgfpathcurveto{\pgfqpoint{0.014cm}{1.467cm}}{\pgfqpoint{0cm}{1.432cm}}{\pgfqpoint{0cm}{1.395cm}}
\pgfpathcurveto{\pgfqpoint{0cm}{1.359cm}}{\pgfqpoint{0.014cm}{1.324cm}}{\pgfqpoint{0.04cm}{1.299cm}}
\pgfpathcurveto{\pgfqpoint{0.066cm}{1.273cm}}{\pgfqpoint{0.1cm}{1.258cm}}{\pgfqpoint{0.137cm}{1.258cm}}
\pgfpathcurveto{\pgfqpoint{0.173cm}{1.258cm}}{\pgfqpoint{0.207cm}{1.273cm}}{\pgfqpoint{0.233cm}{1.299cm}}
\pgfpathcurveto{\pgfqpoint{0.259cm}{1.324cm}}{\pgfqpoint{0.273cm}{1.359cm}}{\pgfqpoint{0.273cm}{1.395cm}}
\pgfusepath{fill}
\begin{pgfscope}
\pgfsetdash{}{0cm}
\pgfsetlinewidth{0.818mm}
\pgfsetmiterlimit{7.0}
\pgfpathmoveto{\pgfqpoint{0.682cm}{0.671cm}}
\pgfpathlineto{\pgfqpoint{0.679cm}{1.418cm}}
\pgfusepath{stroke}
\end{pgfscope}
\pgfpathmoveto{\pgfqpoint{0.815cm}{1.399cm}}
\pgfpathcurveto{\pgfqpoint{0.815cm}{1.435cm}}{\pgfqpoint{0.801cm}{1.47cm}}{\pgfqpoint{0.775cm}{1.496cm}}
\pgfpathcurveto{\pgfqpoint{0.75cm}{1.521cm}}{\pgfqpoint{0.715cm}{1.536cm}}{\pgfqpoint{0.679cm}{1.536cm}}
\pgfpathcurveto{\pgfqpoint{0.643cm}{1.536cm}}{\pgfqpoint{0.608cm}{1.521cm}}{\pgfqpoint{0.582cm}{1.496cm}}
\pgfpathcurveto{\pgfqpoint{0.557cm}{1.47cm}}{\pgfqpoint{0.542cm}{1.435cm}}{\pgfqpoint{0.542cm}{1.399cm}}
\pgfpathcurveto{\pgfqpoint{0.542cm}{1.363cm}}{\pgfqpoint{0.557cm}{1.328cm}}{\pgfqpoint{0.582cm}{1.302cm}}
\pgfpathcurveto{\pgfqpoint{0.608cm}{1.276cm}}{\pgfqpoint{0.643cm}{1.262cm}}{\pgfqpoint{0.679cm}{1.262cm}}
\pgfpathcurveto{\pgfqpoint{0.715cm}{1.262cm}}{\pgfqpoint{0.75cm}{1.276cm}}{\pgfqpoint{0.775cm}{1.302cm}}
\pgfpathcurveto{\pgfqpoint{0.801cm}{1.328cm}}{\pgfqpoint{0.815cm}{1.363cm}}{\pgfqpoint{0.815cm}{1.399cm}}
\pgfusepath{fill}
\pgfpathmoveto{\pgfqpoint{1.345cm}{1.371cm}}
\pgfpathcurveto{\pgfqpoint{1.345cm}{1.408cm}}{\pgfqpoint{1.331cm}{1.442cm}}{\pgfqpoint{1.305cm}{1.468cm}}
\pgfpathcurveto{\pgfqpoint{1.28cm}{1.494cm}}{\pgfqpoint{1.245cm}{1.508cm}}{\pgfqpoint{1.209cm}{1.508cm}}
\pgfpathcurveto{\pgfqpoint{1.172cm}{1.508cm}}{\pgfqpoint{1.138cm}{1.494cm}}{\pgfqpoint{1.112cm}{1.468cm}}
\pgfpathcurveto{\pgfqpoint{1.087cm}{1.442cm}}{\pgfqpoint{1.072cm}{1.408cm}}{\pgfqpoint{1.072cm}{1.371cm}}
\pgfpathcurveto{\pgfqpoint{1.072cm}{1.335cm}}{\pgfqpoint{1.087cm}{1.3cm}}{\pgfqpoint{1.112cm}{1.274cm}}
\pgfpathcurveto{\pgfqpoint{1.138cm}{1.249cm}}{\pgfqpoint{1.172cm}{1.234cm}}{\pgfqpoint{1.209cm}{1.234cm}}
\pgfpathcurveto{\pgfqpoint{1.245cm}{1.234cm}}{\pgfqpoint{1.28cm}{1.249cm}}{\pgfqpoint{1.305cm}{1.274cm}}
\pgfpathcurveto{\pgfqpoint{1.331cm}{1.3cm}}{\pgfqpoint{1.345cm}{1.335cm}}{\pgfqpoint{1.345cm}{1.371cm}}
\pgfusepath{fill}
\begin{pgfscope}
\pgfsetdash{}{0cm}
\pgfsetlinewidth{0.818mm}
\pgfsetroundcap
\pgfsetmiterlimit{4.0}
\pgfpathmoveto{\pgfqpoint{0.682cm}{0.671cm}}
\pgfpathlineto{\pgfqpoint{0.682cm}{0.042cm}}
\pgfusepath{stroke}
\end{pgfscope}
\end{pgfscope}
\end{pgfscope}
\end{pgfscope}
\end{tikzpicture}}}+\phi+\psi)^2+(-X^{\!\resizebox{0.6em}{!}{
\begin{tikzpicture}
\pgfpathmoveto{\pgfqpoint{0cm}{-0.035cm}}
\pgfpathlineto{\pgfqpoint{1.376cm}{-0.035cm}}
\pgfpathlineto{\pgfqpoint{1.376cm}{1.552cm}}
\pgfpathlineto{\pgfqpoint{0cm}{1.552cm}}
\pgfpathclose
\pgfusepath{clip}
\begin{pgfscope}
\begin{pgfscope}
\pgfpathmoveto{\pgfqpoint{0cm}{-0.035cm}}
\pgfpathlineto{\pgfqpoint{1.376cm}{-0.035cm}}
\pgfpathlineto{\pgfqpoint{1.376cm}{1.552cm}}
\pgfpathlineto{\pgfqpoint{0cm}{1.552cm}}
\pgfpathclose
\pgfusepath{clip}
\begin{pgfscope}
\begin{pgfscope}
\pgfsetdash{}{0cm}
\pgfsetlinewidth{0.818mm}
\pgfsetroundcap
\pgfsetroundjoin
\pgfsetmiterlimit{7.0}
\definecolor{eps2pgf_color}{gray}{0}\pgfsetstrokecolor{eps2pgf_color}\pgfsetfillcolor{eps2pgf_color}
\pgfpathmoveto{\pgfqpoint{0.117cm}{1.421cm}}
\pgfpathlineto{\pgfqpoint{0.682cm}{0.671cm}}
\pgfpathlineto{\pgfqpoint{1.246cm}{1.421cm}}
\pgfusepath{stroke}
\end{pgfscope}
\definecolor{eps2pgf_color}{gray}{0}\pgfsetstrokecolor{eps2pgf_color}\pgfsetfillcolor{eps2pgf_color}
\pgfpathmoveto{\pgfqpoint{0.273cm}{1.395cm}}
\pgfpathcurveto{\pgfqpoint{0.273cm}{1.432cm}}{\pgfqpoint{0.259cm}{1.467cm}}{\pgfqpoint{0.233cm}{1.492cm}}
\pgfpathcurveto{\pgfqpoint{0.207cm}{1.518cm}}{\pgfqpoint{0.173cm}{1.532cm}}{\pgfqpoint{0.137cm}{1.532cm}}
\pgfpathcurveto{\pgfqpoint{0.1cm}{1.532cm}}{\pgfqpoint{0.066cm}{1.518cm}}{\pgfqpoint{0.04cm}{1.492cm}}
\pgfpathcurveto{\pgfqpoint{0.014cm}{1.467cm}}{\pgfqpoint{0cm}{1.432cm}}{\pgfqpoint{0cm}{1.395cm}}
\pgfpathcurveto{\pgfqpoint{0cm}{1.359cm}}{\pgfqpoint{0.014cm}{1.324cm}}{\pgfqpoint{0.04cm}{1.299cm}}
\pgfpathcurveto{\pgfqpoint{0.066cm}{1.273cm}}{\pgfqpoint{0.1cm}{1.258cm}}{\pgfqpoint{0.137cm}{1.258cm}}
\pgfpathcurveto{\pgfqpoint{0.173cm}{1.258cm}}{\pgfqpoint{0.207cm}{1.273cm}}{\pgfqpoint{0.233cm}{1.299cm}}
\pgfpathcurveto{\pgfqpoint{0.259cm}{1.324cm}}{\pgfqpoint{0.273cm}{1.359cm}}{\pgfqpoint{0.273cm}{1.395cm}}
\pgfusepath{fill}
\begin{pgfscope}
\pgfsetdash{}{0cm}
\pgfsetlinewidth{0.818mm}
\pgfsetmiterlimit{7.0}
\pgfpathmoveto{\pgfqpoint{0.682cm}{0.671cm}}
\pgfpathlineto{\pgfqpoint{0.679cm}{1.418cm}}
\pgfusepath{stroke}
\end{pgfscope}
\pgfpathmoveto{\pgfqpoint{0.815cm}{1.399cm}}
\pgfpathcurveto{\pgfqpoint{0.815cm}{1.435cm}}{\pgfqpoint{0.801cm}{1.47cm}}{\pgfqpoint{0.775cm}{1.496cm}}
\pgfpathcurveto{\pgfqpoint{0.75cm}{1.521cm}}{\pgfqpoint{0.715cm}{1.536cm}}{\pgfqpoint{0.679cm}{1.536cm}}
\pgfpathcurveto{\pgfqpoint{0.643cm}{1.536cm}}{\pgfqpoint{0.608cm}{1.521cm}}{\pgfqpoint{0.582cm}{1.496cm}}
\pgfpathcurveto{\pgfqpoint{0.557cm}{1.47cm}}{\pgfqpoint{0.542cm}{1.435cm}}{\pgfqpoint{0.542cm}{1.399cm}}
\pgfpathcurveto{\pgfqpoint{0.542cm}{1.363cm}}{\pgfqpoint{0.557cm}{1.328cm}}{\pgfqpoint{0.582cm}{1.302cm}}
\pgfpathcurveto{\pgfqpoint{0.608cm}{1.276cm}}{\pgfqpoint{0.643cm}{1.262cm}}{\pgfqpoint{0.679cm}{1.262cm}}
\pgfpathcurveto{\pgfqpoint{0.715cm}{1.262cm}}{\pgfqpoint{0.75cm}{1.276cm}}{\pgfqpoint{0.775cm}{1.302cm}}
\pgfpathcurveto{\pgfqpoint{0.801cm}{1.328cm}}{\pgfqpoint{0.815cm}{1.363cm}}{\pgfqpoint{0.815cm}{1.399cm}}
\pgfusepath{fill}
\pgfpathmoveto{\pgfqpoint{1.345cm}{1.371cm}}
\pgfpathcurveto{\pgfqpoint{1.345cm}{1.408cm}}{\pgfqpoint{1.331cm}{1.442cm}}{\pgfqpoint{1.305cm}{1.468cm}}
\pgfpathcurveto{\pgfqpoint{1.28cm}{1.494cm}}{\pgfqpoint{1.245cm}{1.508cm}}{\pgfqpoint{1.209cm}{1.508cm}}
\pgfpathcurveto{\pgfqpoint{1.172cm}{1.508cm}}{\pgfqpoint{1.138cm}{1.494cm}}{\pgfqpoint{1.112cm}{1.468cm}}
\pgfpathcurveto{\pgfqpoint{1.087cm}{1.442cm}}{\pgfqpoint{1.072cm}{1.408cm}}{\pgfqpoint{1.072cm}{1.371cm}}
\pgfpathcurveto{\pgfqpoint{1.072cm}{1.335cm}}{\pgfqpoint{1.087cm}{1.3cm}}{\pgfqpoint{1.112cm}{1.274cm}}
\pgfpathcurveto{\pgfqpoint{1.138cm}{1.249cm}}{\pgfqpoint{1.172cm}{1.234cm}}{\pgfqpoint{1.209cm}{1.234cm}}
\pgfpathcurveto{\pgfqpoint{1.245cm}{1.234cm}}{\pgfqpoint{1.28cm}{1.249cm}}{\pgfqpoint{1.305cm}{1.274cm}}
\pgfpathcurveto{\pgfqpoint{1.331cm}{1.3cm}}{\pgfqpoint{1.345cm}{1.335cm}}{\pgfqpoint{1.345cm}{1.371cm}}
\pgfusepath{fill}
\begin{pgfscope}
\pgfsetdash{}{0cm}
\pgfsetlinewidth{0.818mm}
\pgfsetroundcap
\pgfsetmiterlimit{4.0}
\pgfpathmoveto{\pgfqpoint{0.682cm}{0.671cm}}
\pgfpathlineto{\pgfqpoint{0.682cm}{0.042cm}}
\pgfusepath{stroke}
\end{pgfscope}
\end{pgfscope}
\end{pgfscope}
\end{pgfscope}
\end{tikzpicture}}}+\phi+\psi)^3+3b\varphi.
\end{aligned}
\end{align}
Next, we proceed with the same decomposition into regular (blue) and irregular (magenta) part as in Section \ref{s:phi}. It leads  to
\begin{align}\label{eq:two43}
\begin{aligned}
& \LL \phi + \rmm{\Phi} = 0,\quad\phi(0)=\phi_{0}=\varphi_{0},\\
&  \LL \psi + \psi^3 + \rmb{\tilde\Psi} = 0,\quad\psi(0)=0,
  \end{aligned}
 \end{align}
 where
 \begin{equation}\label{eq:two43aa}
 \rmb{\tilde\Psi}:=  \rmb{\Psi} -9\llbracket X^{2}\rrbracket \circ[(-X^{\!\resizebox{0.6em}{!}{
\begin{tikzpicture}
\pgfpathmoveto{\pgfqpoint{0cm}{-0.035cm}}
\pgfpathlineto{\pgfqpoint{1.376cm}{-0.035cm}}
\pgfpathlineto{\pgfqpoint{1.376cm}{1.552cm}}
\pgfpathlineto{\pgfqpoint{0cm}{1.552cm}}
\pgfpathclose
\pgfusepath{clip}
\begin{pgfscope}
\begin{pgfscope}
\pgfpathmoveto{\pgfqpoint{0cm}{-0.035cm}}
\pgfpathlineto{\pgfqpoint{1.376cm}{-0.035cm}}
\pgfpathlineto{\pgfqpoint{1.376cm}{1.552cm}}
\pgfpathlineto{\pgfqpoint{0cm}{1.552cm}}
\pgfpathclose
\pgfusepath{clip}
\begin{pgfscope}
\begin{pgfscope}
\pgfsetdash{}{0cm}
\pgfsetlinewidth{0.818mm}
\pgfsetroundcap
\pgfsetroundjoin
\pgfsetmiterlimit{7.0}
\definecolor{eps2pgf_color}{gray}{0}\pgfsetstrokecolor{eps2pgf_color}\pgfsetfillcolor{eps2pgf_color}
\pgfpathmoveto{\pgfqpoint{0.117cm}{1.421cm}}
\pgfpathlineto{\pgfqpoint{0.682cm}{0.671cm}}
\pgfpathlineto{\pgfqpoint{1.246cm}{1.421cm}}
\pgfusepath{stroke}
\end{pgfscope}
\definecolor{eps2pgf_color}{gray}{0}\pgfsetstrokecolor{eps2pgf_color}\pgfsetfillcolor{eps2pgf_color}
\pgfpathmoveto{\pgfqpoint{0.273cm}{1.395cm}}
\pgfpathcurveto{\pgfqpoint{0.273cm}{1.432cm}}{\pgfqpoint{0.259cm}{1.467cm}}{\pgfqpoint{0.233cm}{1.492cm}}
\pgfpathcurveto{\pgfqpoint{0.207cm}{1.518cm}}{\pgfqpoint{0.173cm}{1.532cm}}{\pgfqpoint{0.137cm}{1.532cm}}
\pgfpathcurveto{\pgfqpoint{0.1cm}{1.532cm}}{\pgfqpoint{0.066cm}{1.518cm}}{\pgfqpoint{0.04cm}{1.492cm}}
\pgfpathcurveto{\pgfqpoint{0.014cm}{1.467cm}}{\pgfqpoint{0cm}{1.432cm}}{\pgfqpoint{0cm}{1.395cm}}
\pgfpathcurveto{\pgfqpoint{0cm}{1.359cm}}{\pgfqpoint{0.014cm}{1.324cm}}{\pgfqpoint{0.04cm}{1.299cm}}
\pgfpathcurveto{\pgfqpoint{0.066cm}{1.273cm}}{\pgfqpoint{0.1cm}{1.258cm}}{\pgfqpoint{0.137cm}{1.258cm}}
\pgfpathcurveto{\pgfqpoint{0.173cm}{1.258cm}}{\pgfqpoint{0.207cm}{1.273cm}}{\pgfqpoint{0.233cm}{1.299cm}}
\pgfpathcurveto{\pgfqpoint{0.259cm}{1.324cm}}{\pgfqpoint{0.273cm}{1.359cm}}{\pgfqpoint{0.273cm}{1.395cm}}
\pgfusepath{fill}
\begin{pgfscope}
\pgfsetdash{}{0cm}
\pgfsetlinewidth{0.818mm}
\pgfsetmiterlimit{7.0}
\pgfpathmoveto{\pgfqpoint{0.682cm}{0.671cm}}
\pgfpathlineto{\pgfqpoint{0.679cm}{1.418cm}}
\pgfusepath{stroke}
\end{pgfscope}
\pgfpathmoveto{\pgfqpoint{0.815cm}{1.399cm}}
\pgfpathcurveto{\pgfqpoint{0.815cm}{1.435cm}}{\pgfqpoint{0.801cm}{1.47cm}}{\pgfqpoint{0.775cm}{1.496cm}}
\pgfpathcurveto{\pgfqpoint{0.75cm}{1.521cm}}{\pgfqpoint{0.715cm}{1.536cm}}{\pgfqpoint{0.679cm}{1.536cm}}
\pgfpathcurveto{\pgfqpoint{0.643cm}{1.536cm}}{\pgfqpoint{0.608cm}{1.521cm}}{\pgfqpoint{0.582cm}{1.496cm}}
\pgfpathcurveto{\pgfqpoint{0.557cm}{1.47cm}}{\pgfqpoint{0.542cm}{1.435cm}}{\pgfqpoint{0.542cm}{1.399cm}}
\pgfpathcurveto{\pgfqpoint{0.542cm}{1.363cm}}{\pgfqpoint{0.557cm}{1.328cm}}{\pgfqpoint{0.582cm}{1.302cm}}
\pgfpathcurveto{\pgfqpoint{0.608cm}{1.276cm}}{\pgfqpoint{0.643cm}{1.262cm}}{\pgfqpoint{0.679cm}{1.262cm}}
\pgfpathcurveto{\pgfqpoint{0.715cm}{1.262cm}}{\pgfqpoint{0.75cm}{1.276cm}}{\pgfqpoint{0.775cm}{1.302cm}}
\pgfpathcurveto{\pgfqpoint{0.801cm}{1.328cm}}{\pgfqpoint{0.815cm}{1.363cm}}{\pgfqpoint{0.815cm}{1.399cm}}
\pgfusepath{fill}
\pgfpathmoveto{\pgfqpoint{1.345cm}{1.371cm}}
\pgfpathcurveto{\pgfqpoint{1.345cm}{1.408cm}}{\pgfqpoint{1.331cm}{1.442cm}}{\pgfqpoint{1.305cm}{1.468cm}}
\pgfpathcurveto{\pgfqpoint{1.28cm}{1.494cm}}{\pgfqpoint{1.245cm}{1.508cm}}{\pgfqpoint{1.209cm}{1.508cm}}
\pgfpathcurveto{\pgfqpoint{1.172cm}{1.508cm}}{\pgfqpoint{1.138cm}{1.494cm}}{\pgfqpoint{1.112cm}{1.468cm}}
\pgfpathcurveto{\pgfqpoint{1.087cm}{1.442cm}}{\pgfqpoint{1.072cm}{1.408cm}}{\pgfqpoint{1.072cm}{1.371cm}}
\pgfpathcurveto{\pgfqpoint{1.072cm}{1.335cm}}{\pgfqpoint{1.087cm}{1.3cm}}{\pgfqpoint{1.112cm}{1.274cm}}
\pgfpathcurveto{\pgfqpoint{1.138cm}{1.249cm}}{\pgfqpoint{1.172cm}{1.234cm}}{\pgfqpoint{1.209cm}{1.234cm}}
\pgfpathcurveto{\pgfqpoint{1.245cm}{1.234cm}}{\pgfqpoint{1.28cm}{1.249cm}}{\pgfqpoint{1.305cm}{1.274cm}}
\pgfpathcurveto{\pgfqpoint{1.331cm}{1.3cm}}{\pgfqpoint{1.345cm}{1.335cm}}{\pgfqpoint{1.345cm}{1.371cm}}
\pgfusepath{fill}
\begin{pgfscope}
\pgfsetdash{}{0cm}
\pgfsetlinewidth{0.818mm}
\pgfsetroundcap
\pgfsetmiterlimit{4.0}
\pgfpathmoveto{\pgfqpoint{0.682cm}{0.671cm}}
\pgfpathlineto{\pgfqpoint{0.682cm}{0.042cm}}
\pgfusepath{stroke}
\end{pgfscope}
\end{pgfscope}
\end{pgfscope}
\end{pgfscope}
\end{tikzpicture}}}+\phi+\psi)\precprec X^{\!\resizebox{0.6em}{!}{
\begin{tikzpicture}
\pgfpathmoveto{\pgfqpoint{0cm}{0cm}}
\pgfpathlineto{\pgfqpoint{1.376cm}{0cm}}
\pgfpathlineto{\pgfqpoint{1.376cm}{1.588cm}}
\pgfpathlineto{\pgfqpoint{0cm}{1.588cm}}
\pgfpathclose
\pgfusepath{clip}
\begin{pgfscope}
\begin{pgfscope}
\pgfpathmoveto{\pgfqpoint{0cm}{0cm}}
\pgfpathlineto{\pgfqpoint{1.376cm}{0cm}}
\pgfpathlineto{\pgfqpoint{1.376cm}{1.588cm}}
\pgfpathlineto{\pgfqpoint{0cm}{1.588cm}}
\pgfpathclose
\pgfusepath{clip}
\begin{pgfscope}
\begin{pgfscope}
\definecolor{eps2pgf_color}{gray}{0.976471}\pgfsetstrokecolor{eps2pgf_color}\pgfsetfillcolor{eps2pgf_color}
\pgfpathmoveto{\pgfqpoint{0cm}{0cm}}
\pgfpathlineto{\pgfqpoint{1.376cm}{0cm}}
\pgfpathlineto{\pgfqpoint{1.376cm}{1.588cm}}
\pgfpathlineto{\pgfqpoint{0cm}{1.588cm}}
\pgfpathclose
\pgfusepath{fill}
\end{pgfscope}
\begin{pgfscope}
\pgfsetdash{}{0cm}
\pgfsetlinewidth{0.818mm}
\pgfsetroundcap
\pgfsetroundjoin
\pgfsetmiterlimit{7.0}
\definecolor{eps2pgf_color}{gray}{0}\pgfsetstrokecolor{eps2pgf_color}\pgfsetfillcolor{eps2pgf_color}
\pgfpathmoveto{\pgfqpoint{0.117cm}{1.476cm}}
\pgfpathlineto{\pgfqpoint{0.682cm}{0.726cm}}
\pgfpathlineto{\pgfqpoint{1.246cm}{1.476cm}}
\pgfusepath{stroke}
\end{pgfscope}
\definecolor{eps2pgf_color}{gray}{0}\pgfsetstrokecolor{eps2pgf_color}\pgfsetfillcolor{eps2pgf_color}
\pgfpathmoveto{\pgfqpoint{0.273cm}{1.451cm}}
\pgfpathcurveto{\pgfqpoint{0.273cm}{1.487cm}}{\pgfqpoint{0.259cm}{1.522cm}}{\pgfqpoint{0.233cm}{1.547cm}}
\pgfpathcurveto{\pgfqpoint{0.207cm}{1.573cm}}{\pgfqpoint{0.173cm}{1.588cm}}{\pgfqpoint{0.137cm}{1.588cm}}
\pgfpathcurveto{\pgfqpoint{0.1cm}{1.588cm}}{\pgfqpoint{0.066cm}{1.573cm}}{\pgfqpoint{0.04cm}{1.547cm}}
\pgfpathcurveto{\pgfqpoint{0.014cm}{1.522cm}}{\pgfqpoint{0cm}{1.487cm}}{\pgfqpoint{0cm}{1.451cm}}
\pgfpathcurveto{\pgfqpoint{0cm}{1.414cm}}{\pgfqpoint{0.014cm}{1.379cm}}{\pgfqpoint{0.04cm}{1.354cm}}
\pgfpathcurveto{\pgfqpoint{0.066cm}{1.328cm}}{\pgfqpoint{0.1cm}{1.314cm}}{\pgfqpoint{0.137cm}{1.314cm}}
\pgfpathcurveto{\pgfqpoint{0.173cm}{1.314cm}}{\pgfqpoint{0.207cm}{1.328cm}}{\pgfqpoint{0.233cm}{1.354cm}}
\pgfpathcurveto{\pgfqpoint{0.259cm}{1.379cm}}{\pgfqpoint{0.273cm}{1.414cm}}{\pgfqpoint{0.273cm}{1.451cm}}
\pgfusepath{fill}
\pgfpathmoveto{\pgfqpoint{1.345cm}{1.426cm}}
\pgfpathcurveto{\pgfqpoint{1.345cm}{1.463cm}}{\pgfqpoint{1.331cm}{1.497cm}}{\pgfqpoint{1.305cm}{1.523cm}}
\pgfpathcurveto{\pgfqpoint{1.28cm}{1.549cm}}{\pgfqpoint{1.245cm}{1.563cm}}{\pgfqpoint{1.209cm}{1.563cm}}
\pgfpathcurveto{\pgfqpoint{1.172cm}{1.563cm}}{\pgfqpoint{1.138cm}{1.549cm}}{\pgfqpoint{1.112cm}{1.523cm}}
\pgfpathcurveto{\pgfqpoint{1.087cm}{1.497cm}}{\pgfqpoint{1.072cm}{1.463cm}}{\pgfqpoint{1.072cm}{1.426cm}}
\pgfpathcurveto{\pgfqpoint{1.072cm}{1.39cm}}{\pgfqpoint{1.087cm}{1.355cm}}{\pgfqpoint{1.112cm}{1.329cm}}
\pgfpathcurveto{\pgfqpoint{1.138cm}{1.304cm}}{\pgfqpoint{1.172cm}{1.289cm}}{\pgfqpoint{1.209cm}{1.289cm}}
\pgfpathcurveto{\pgfqpoint{1.245cm}{1.289cm}}{\pgfqpoint{1.28cm}{1.304cm}}{\pgfqpoint{1.305cm}{1.329cm}}
\pgfpathcurveto{\pgfqpoint{1.331cm}{1.355cm}}{\pgfqpoint{1.345cm}{1.39cm}}{\pgfqpoint{1.345cm}{1.426cm}}
\pgfusepath{fill}
\begin{pgfscope}
\pgfsetdash{}{0cm}
\pgfsetlinewidth{0.818mm}
\pgfsetroundcap
\pgfsetmiterlimit{4.0}
\pgfpathmoveto{\pgfqpoint{0.682cm}{0.726cm}}
\pgfpathlineto{\pgfqpoint{0.682cm}{0.097cm}}
\pgfusepath{stroke}
\end{pgfscope}
\end{pgfscope}
\end{pgfscope}
\end{pgfscope}
\end{tikzpicture}}}-(-X^{\!\resizebox{0.6em}{!}{
\begin{tikzpicture}
\pgfpathmoveto{\pgfqpoint{0cm}{-0.035cm}}
\pgfpathlineto{\pgfqpoint{1.376cm}{-0.035cm}}
\pgfpathlineto{\pgfqpoint{1.376cm}{1.552cm}}
\pgfpathlineto{\pgfqpoint{0cm}{1.552cm}}
\pgfpathclose
\pgfusepath{clip}
\begin{pgfscope}
\begin{pgfscope}
\pgfpathmoveto{\pgfqpoint{0cm}{-0.035cm}}
\pgfpathlineto{\pgfqpoint{1.376cm}{-0.035cm}}
\pgfpathlineto{\pgfqpoint{1.376cm}{1.552cm}}
\pgfpathlineto{\pgfqpoint{0cm}{1.552cm}}
\pgfpathclose
\pgfusepath{clip}
\begin{pgfscope}
\begin{pgfscope}
\pgfsetdash{}{0cm}
\pgfsetlinewidth{0.818mm}
\pgfsetroundcap
\pgfsetroundjoin
\pgfsetmiterlimit{7.0}
\definecolor{eps2pgf_color}{gray}{0}\pgfsetstrokecolor{eps2pgf_color}\pgfsetfillcolor{eps2pgf_color}
\pgfpathmoveto{\pgfqpoint{0.117cm}{1.421cm}}
\pgfpathlineto{\pgfqpoint{0.682cm}{0.671cm}}
\pgfpathlineto{\pgfqpoint{1.246cm}{1.421cm}}
\pgfusepath{stroke}
\end{pgfscope}
\definecolor{eps2pgf_color}{gray}{0}\pgfsetstrokecolor{eps2pgf_color}\pgfsetfillcolor{eps2pgf_color}
\pgfpathmoveto{\pgfqpoint{0.273cm}{1.395cm}}
\pgfpathcurveto{\pgfqpoint{0.273cm}{1.432cm}}{\pgfqpoint{0.259cm}{1.467cm}}{\pgfqpoint{0.233cm}{1.492cm}}
\pgfpathcurveto{\pgfqpoint{0.207cm}{1.518cm}}{\pgfqpoint{0.173cm}{1.532cm}}{\pgfqpoint{0.137cm}{1.532cm}}
\pgfpathcurveto{\pgfqpoint{0.1cm}{1.532cm}}{\pgfqpoint{0.066cm}{1.518cm}}{\pgfqpoint{0.04cm}{1.492cm}}
\pgfpathcurveto{\pgfqpoint{0.014cm}{1.467cm}}{\pgfqpoint{0cm}{1.432cm}}{\pgfqpoint{0cm}{1.395cm}}
\pgfpathcurveto{\pgfqpoint{0cm}{1.359cm}}{\pgfqpoint{0.014cm}{1.324cm}}{\pgfqpoint{0.04cm}{1.299cm}}
\pgfpathcurveto{\pgfqpoint{0.066cm}{1.273cm}}{\pgfqpoint{0.1cm}{1.258cm}}{\pgfqpoint{0.137cm}{1.258cm}}
\pgfpathcurveto{\pgfqpoint{0.173cm}{1.258cm}}{\pgfqpoint{0.207cm}{1.273cm}}{\pgfqpoint{0.233cm}{1.299cm}}
\pgfpathcurveto{\pgfqpoint{0.259cm}{1.324cm}}{\pgfqpoint{0.273cm}{1.359cm}}{\pgfqpoint{0.273cm}{1.395cm}}
\pgfusepath{fill}
\begin{pgfscope}
\pgfsetdash{}{0cm}
\pgfsetlinewidth{0.818mm}
\pgfsetmiterlimit{7.0}
\pgfpathmoveto{\pgfqpoint{0.682cm}{0.671cm}}
\pgfpathlineto{\pgfqpoint{0.679cm}{1.418cm}}
\pgfusepath{stroke}
\end{pgfscope}
\pgfpathmoveto{\pgfqpoint{0.815cm}{1.399cm}}
\pgfpathcurveto{\pgfqpoint{0.815cm}{1.435cm}}{\pgfqpoint{0.801cm}{1.47cm}}{\pgfqpoint{0.775cm}{1.496cm}}
\pgfpathcurveto{\pgfqpoint{0.75cm}{1.521cm}}{\pgfqpoint{0.715cm}{1.536cm}}{\pgfqpoint{0.679cm}{1.536cm}}
\pgfpathcurveto{\pgfqpoint{0.643cm}{1.536cm}}{\pgfqpoint{0.608cm}{1.521cm}}{\pgfqpoint{0.582cm}{1.496cm}}
\pgfpathcurveto{\pgfqpoint{0.557cm}{1.47cm}}{\pgfqpoint{0.542cm}{1.435cm}}{\pgfqpoint{0.542cm}{1.399cm}}
\pgfpathcurveto{\pgfqpoint{0.542cm}{1.363cm}}{\pgfqpoint{0.557cm}{1.328cm}}{\pgfqpoint{0.582cm}{1.302cm}}
\pgfpathcurveto{\pgfqpoint{0.608cm}{1.276cm}}{\pgfqpoint{0.643cm}{1.262cm}}{\pgfqpoint{0.679cm}{1.262cm}}
\pgfpathcurveto{\pgfqpoint{0.715cm}{1.262cm}}{\pgfqpoint{0.75cm}{1.276cm}}{\pgfqpoint{0.775cm}{1.302cm}}
\pgfpathcurveto{\pgfqpoint{0.801cm}{1.328cm}}{\pgfqpoint{0.815cm}{1.363cm}}{\pgfqpoint{0.815cm}{1.399cm}}
\pgfusepath{fill}
\pgfpathmoveto{\pgfqpoint{1.345cm}{1.371cm}}
\pgfpathcurveto{\pgfqpoint{1.345cm}{1.408cm}}{\pgfqpoint{1.331cm}{1.442cm}}{\pgfqpoint{1.305cm}{1.468cm}}
\pgfpathcurveto{\pgfqpoint{1.28cm}{1.494cm}}{\pgfqpoint{1.245cm}{1.508cm}}{\pgfqpoint{1.209cm}{1.508cm}}
\pgfpathcurveto{\pgfqpoint{1.172cm}{1.508cm}}{\pgfqpoint{1.138cm}{1.494cm}}{\pgfqpoint{1.112cm}{1.468cm}}
\pgfpathcurveto{\pgfqpoint{1.087cm}{1.442cm}}{\pgfqpoint{1.072cm}{1.408cm}}{\pgfqpoint{1.072cm}{1.371cm}}
\pgfpathcurveto{\pgfqpoint{1.072cm}{1.335cm}}{\pgfqpoint{1.087cm}{1.3cm}}{\pgfqpoint{1.112cm}{1.274cm}}
\pgfpathcurveto{\pgfqpoint{1.138cm}{1.249cm}}{\pgfqpoint{1.172cm}{1.234cm}}{\pgfqpoint{1.209cm}{1.234cm}}
\pgfpathcurveto{\pgfqpoint{1.245cm}{1.234cm}}{\pgfqpoint{1.28cm}{1.249cm}}{\pgfqpoint{1.305cm}{1.274cm}}
\pgfpathcurveto{\pgfqpoint{1.331cm}{1.3cm}}{\pgfqpoint{1.345cm}{1.335cm}}{\pgfqpoint{1.345cm}{1.371cm}}
\pgfusepath{fill}
\begin{pgfscope}
\pgfsetdash{}{0cm}
\pgfsetlinewidth{0.818mm}
\pgfsetroundcap
\pgfsetmiterlimit{4.0}
\pgfpathmoveto{\pgfqpoint{0.682cm}{0.671cm}}
\pgfpathlineto{\pgfqpoint{0.682cm}{0.042cm}}
\pgfusepath{stroke}
\end{pgfscope}
\end{pgfscope}
\end{pgfscope}
\end{pgfscope}
\end{tikzpicture}}}+\phi+\psi)\prec X^{\!\resizebox{0.6em}{!}{
\begin{tikzpicture}
\pgfpathmoveto{\pgfqpoint{0cm}{0cm}}
\pgfpathlineto{\pgfqpoint{1.376cm}{0cm}}
\pgfpathlineto{\pgfqpoint{1.376cm}{1.588cm}}
\pgfpathlineto{\pgfqpoint{0cm}{1.588cm}}
\pgfpathclose
\pgfusepath{clip}
\begin{pgfscope}
\begin{pgfscope}
\pgfpathmoveto{\pgfqpoint{0cm}{0cm}}
\pgfpathlineto{\pgfqpoint{1.376cm}{0cm}}
\pgfpathlineto{\pgfqpoint{1.376cm}{1.588cm}}
\pgfpathlineto{\pgfqpoint{0cm}{1.588cm}}
\pgfpathclose
\pgfusepath{clip}
\begin{pgfscope}
\begin{pgfscope}
\definecolor{eps2pgf_color}{gray}{0.976471}\pgfsetstrokecolor{eps2pgf_color}\pgfsetfillcolor{eps2pgf_color}
\pgfpathmoveto{\pgfqpoint{0cm}{0cm}}
\pgfpathlineto{\pgfqpoint{1.376cm}{0cm}}
\pgfpathlineto{\pgfqpoint{1.376cm}{1.588cm}}
\pgfpathlineto{\pgfqpoint{0cm}{1.588cm}}
\pgfpathclose
\pgfusepath{fill}
\end{pgfscope}
\begin{pgfscope}
\pgfsetdash{}{0cm}
\pgfsetlinewidth{0.818mm}
\pgfsetroundcap
\pgfsetroundjoin
\pgfsetmiterlimit{7.0}
\definecolor{eps2pgf_color}{gray}{0}\pgfsetstrokecolor{eps2pgf_color}\pgfsetfillcolor{eps2pgf_color}
\pgfpathmoveto{\pgfqpoint{0.117cm}{1.476cm}}
\pgfpathlineto{\pgfqpoint{0.682cm}{0.726cm}}
\pgfpathlineto{\pgfqpoint{1.246cm}{1.476cm}}
\pgfusepath{stroke}
\end{pgfscope}
\definecolor{eps2pgf_color}{gray}{0}\pgfsetstrokecolor{eps2pgf_color}\pgfsetfillcolor{eps2pgf_color}
\pgfpathmoveto{\pgfqpoint{0.273cm}{1.451cm}}
\pgfpathcurveto{\pgfqpoint{0.273cm}{1.487cm}}{\pgfqpoint{0.259cm}{1.522cm}}{\pgfqpoint{0.233cm}{1.547cm}}
\pgfpathcurveto{\pgfqpoint{0.207cm}{1.573cm}}{\pgfqpoint{0.173cm}{1.588cm}}{\pgfqpoint{0.137cm}{1.588cm}}
\pgfpathcurveto{\pgfqpoint{0.1cm}{1.588cm}}{\pgfqpoint{0.066cm}{1.573cm}}{\pgfqpoint{0.04cm}{1.547cm}}
\pgfpathcurveto{\pgfqpoint{0.014cm}{1.522cm}}{\pgfqpoint{0cm}{1.487cm}}{\pgfqpoint{0cm}{1.451cm}}
\pgfpathcurveto{\pgfqpoint{0cm}{1.414cm}}{\pgfqpoint{0.014cm}{1.379cm}}{\pgfqpoint{0.04cm}{1.354cm}}
\pgfpathcurveto{\pgfqpoint{0.066cm}{1.328cm}}{\pgfqpoint{0.1cm}{1.314cm}}{\pgfqpoint{0.137cm}{1.314cm}}
\pgfpathcurveto{\pgfqpoint{0.173cm}{1.314cm}}{\pgfqpoint{0.207cm}{1.328cm}}{\pgfqpoint{0.233cm}{1.354cm}}
\pgfpathcurveto{\pgfqpoint{0.259cm}{1.379cm}}{\pgfqpoint{0.273cm}{1.414cm}}{\pgfqpoint{0.273cm}{1.451cm}}
\pgfusepath{fill}
\pgfpathmoveto{\pgfqpoint{1.345cm}{1.426cm}}
\pgfpathcurveto{\pgfqpoint{1.345cm}{1.463cm}}{\pgfqpoint{1.331cm}{1.497cm}}{\pgfqpoint{1.305cm}{1.523cm}}
\pgfpathcurveto{\pgfqpoint{1.28cm}{1.549cm}}{\pgfqpoint{1.245cm}{1.563cm}}{\pgfqpoint{1.209cm}{1.563cm}}
\pgfpathcurveto{\pgfqpoint{1.172cm}{1.563cm}}{\pgfqpoint{1.138cm}{1.549cm}}{\pgfqpoint{1.112cm}{1.523cm}}
\pgfpathcurveto{\pgfqpoint{1.087cm}{1.497cm}}{\pgfqpoint{1.072cm}{1.463cm}}{\pgfqpoint{1.072cm}{1.426cm}}
\pgfpathcurveto{\pgfqpoint{1.072cm}{1.39cm}}{\pgfqpoint{1.087cm}{1.355cm}}{\pgfqpoint{1.112cm}{1.329cm}}
\pgfpathcurveto{\pgfqpoint{1.138cm}{1.304cm}}{\pgfqpoint{1.172cm}{1.289cm}}{\pgfqpoint{1.209cm}{1.289cm}}
\pgfpathcurveto{\pgfqpoint{1.245cm}{1.289cm}}{\pgfqpoint{1.28cm}{1.304cm}}{\pgfqpoint{1.305cm}{1.329cm}}
\pgfpathcurveto{\pgfqpoint{1.331cm}{1.355cm}}{\pgfqpoint{1.345cm}{1.39cm}}{\pgfqpoint{1.345cm}{1.426cm}}
\pgfusepath{fill}
\begin{pgfscope}
\pgfsetdash{}{0cm}
\pgfsetlinewidth{0.818mm}
\pgfsetroundcap
\pgfsetmiterlimit{4.0}
\pgfpathmoveto{\pgfqpoint{0.682cm}{0.726cm}}
\pgfpathlineto{\pgfqpoint{0.682cm}{0.097cm}}
\pgfusepath{stroke}
\end{pgfscope}
\end{pgfscope}
\end{pgfscope}
\end{pgfscope}
\end{tikzpicture}}}],
 \end{equation}
 and $\rmm{\Phi}$, $\rmb{\Psi}$ are given exactly as in Section \ref{sec:45} (using the parabolic localizers $\VV_{\leq},\VV_{>}$ instead of the elliptic ones $\UU_{\leq},\UU_{>}$) with the same bounds applied pointwise in time, and $\varphi_{0}\in \CC^{1+\alpha}(\rho^{3+\gamma'}_{0})$ for some $\gamma'\in(0,\gamma)$ to be chosen below (the role of the parameter $\gamma$ is the same as in Section \ref{sec:45}, in particular, $\gamma=\alpha-\kappa$). The additional commutator in \eqref{eq:two43aa} is bounded as
 \begin{align*}
&\| 9\llbracket X^{2}\rrbracket \circ[(-X^{\!\resizebox{0.6em}{!}{
\begin{tikzpicture}
\pgfpathmoveto{\pgfqpoint{0cm}{-0.035cm}}
\pgfpathlineto{\pgfqpoint{1.376cm}{-0.035cm}}
\pgfpathlineto{\pgfqpoint{1.376cm}{1.552cm}}
\pgfpathlineto{\pgfqpoint{0cm}{1.552cm}}
\pgfpathclose
\pgfusepath{clip}
\begin{pgfscope}
\begin{pgfscope}
\pgfpathmoveto{\pgfqpoint{0cm}{-0.035cm}}
\pgfpathlineto{\pgfqpoint{1.376cm}{-0.035cm}}
\pgfpathlineto{\pgfqpoint{1.376cm}{1.552cm}}
\pgfpathlineto{\pgfqpoint{0cm}{1.552cm}}
\pgfpathclose
\pgfusepath{clip}
\begin{pgfscope}
\begin{pgfscope}
\pgfsetdash{}{0cm}
\pgfsetlinewidth{0.818mm}
\pgfsetroundcap
\pgfsetroundjoin
\pgfsetmiterlimit{7.0}
\definecolor{eps2pgf_color}{gray}{0}\pgfsetstrokecolor{eps2pgf_color}\pgfsetfillcolor{eps2pgf_color}
\pgfpathmoveto{\pgfqpoint{0.117cm}{1.421cm}}
\pgfpathlineto{\pgfqpoint{0.682cm}{0.671cm}}
\pgfpathlineto{\pgfqpoint{1.246cm}{1.421cm}}
\pgfusepath{stroke}
\end{pgfscope}
\definecolor{eps2pgf_color}{gray}{0}\pgfsetstrokecolor{eps2pgf_color}\pgfsetfillcolor{eps2pgf_color}
\pgfpathmoveto{\pgfqpoint{0.273cm}{1.395cm}}
\pgfpathcurveto{\pgfqpoint{0.273cm}{1.432cm}}{\pgfqpoint{0.259cm}{1.467cm}}{\pgfqpoint{0.233cm}{1.492cm}}
\pgfpathcurveto{\pgfqpoint{0.207cm}{1.518cm}}{\pgfqpoint{0.173cm}{1.532cm}}{\pgfqpoint{0.137cm}{1.532cm}}
\pgfpathcurveto{\pgfqpoint{0.1cm}{1.532cm}}{\pgfqpoint{0.066cm}{1.518cm}}{\pgfqpoint{0.04cm}{1.492cm}}
\pgfpathcurveto{\pgfqpoint{0.014cm}{1.467cm}}{\pgfqpoint{0cm}{1.432cm}}{\pgfqpoint{0cm}{1.395cm}}
\pgfpathcurveto{\pgfqpoint{0cm}{1.359cm}}{\pgfqpoint{0.014cm}{1.324cm}}{\pgfqpoint{0.04cm}{1.299cm}}
\pgfpathcurveto{\pgfqpoint{0.066cm}{1.273cm}}{\pgfqpoint{0.1cm}{1.258cm}}{\pgfqpoint{0.137cm}{1.258cm}}
\pgfpathcurveto{\pgfqpoint{0.173cm}{1.258cm}}{\pgfqpoint{0.207cm}{1.273cm}}{\pgfqpoint{0.233cm}{1.299cm}}
\pgfpathcurveto{\pgfqpoint{0.259cm}{1.324cm}}{\pgfqpoint{0.273cm}{1.359cm}}{\pgfqpoint{0.273cm}{1.395cm}}
\pgfusepath{fill}
\begin{pgfscope}
\pgfsetdash{}{0cm}
\pgfsetlinewidth{0.818mm}
\pgfsetmiterlimit{7.0}
\pgfpathmoveto{\pgfqpoint{0.682cm}{0.671cm}}
\pgfpathlineto{\pgfqpoint{0.679cm}{1.418cm}}
\pgfusepath{stroke}
\end{pgfscope}
\pgfpathmoveto{\pgfqpoint{0.815cm}{1.399cm}}
\pgfpathcurveto{\pgfqpoint{0.815cm}{1.435cm}}{\pgfqpoint{0.801cm}{1.47cm}}{\pgfqpoint{0.775cm}{1.496cm}}
\pgfpathcurveto{\pgfqpoint{0.75cm}{1.521cm}}{\pgfqpoint{0.715cm}{1.536cm}}{\pgfqpoint{0.679cm}{1.536cm}}
\pgfpathcurveto{\pgfqpoint{0.643cm}{1.536cm}}{\pgfqpoint{0.608cm}{1.521cm}}{\pgfqpoint{0.582cm}{1.496cm}}
\pgfpathcurveto{\pgfqpoint{0.557cm}{1.47cm}}{\pgfqpoint{0.542cm}{1.435cm}}{\pgfqpoint{0.542cm}{1.399cm}}
\pgfpathcurveto{\pgfqpoint{0.542cm}{1.363cm}}{\pgfqpoint{0.557cm}{1.328cm}}{\pgfqpoint{0.582cm}{1.302cm}}
\pgfpathcurveto{\pgfqpoint{0.608cm}{1.276cm}}{\pgfqpoint{0.643cm}{1.262cm}}{\pgfqpoint{0.679cm}{1.262cm}}
\pgfpathcurveto{\pgfqpoint{0.715cm}{1.262cm}}{\pgfqpoint{0.75cm}{1.276cm}}{\pgfqpoint{0.775cm}{1.302cm}}
\pgfpathcurveto{\pgfqpoint{0.801cm}{1.328cm}}{\pgfqpoint{0.815cm}{1.363cm}}{\pgfqpoint{0.815cm}{1.399cm}}
\pgfusepath{fill}
\pgfpathmoveto{\pgfqpoint{1.345cm}{1.371cm}}
\pgfpathcurveto{\pgfqpoint{1.345cm}{1.408cm}}{\pgfqpoint{1.331cm}{1.442cm}}{\pgfqpoint{1.305cm}{1.468cm}}
\pgfpathcurveto{\pgfqpoint{1.28cm}{1.494cm}}{\pgfqpoint{1.245cm}{1.508cm}}{\pgfqpoint{1.209cm}{1.508cm}}
\pgfpathcurveto{\pgfqpoint{1.172cm}{1.508cm}}{\pgfqpoint{1.138cm}{1.494cm}}{\pgfqpoint{1.112cm}{1.468cm}}
\pgfpathcurveto{\pgfqpoint{1.087cm}{1.442cm}}{\pgfqpoint{1.072cm}{1.408cm}}{\pgfqpoint{1.072cm}{1.371cm}}
\pgfpathcurveto{\pgfqpoint{1.072cm}{1.335cm}}{\pgfqpoint{1.087cm}{1.3cm}}{\pgfqpoint{1.112cm}{1.274cm}}
\pgfpathcurveto{\pgfqpoint{1.138cm}{1.249cm}}{\pgfqpoint{1.172cm}{1.234cm}}{\pgfqpoint{1.209cm}{1.234cm}}
\pgfpathcurveto{\pgfqpoint{1.245cm}{1.234cm}}{\pgfqpoint{1.28cm}{1.249cm}}{\pgfqpoint{1.305cm}{1.274cm}}
\pgfpathcurveto{\pgfqpoint{1.331cm}{1.3cm}}{\pgfqpoint{1.345cm}{1.335cm}}{\pgfqpoint{1.345cm}{1.371cm}}
\pgfusepath{fill}
\begin{pgfscope}
\pgfsetdash{}{0cm}
\pgfsetlinewidth{0.818mm}
\pgfsetroundcap
\pgfsetmiterlimit{4.0}
\pgfpathmoveto{\pgfqpoint{0.682cm}{0.671cm}}
\pgfpathlineto{\pgfqpoint{0.682cm}{0.042cm}}
\pgfusepath{stroke}
\end{pgfscope}
\end{pgfscope}
\end{pgfscope}
\end{pgfscope}
\end{tikzpicture}}}+\phi+\psi)\precprec X^{\!\resizebox{0.6em}{!}{
\begin{tikzpicture}
\pgfpathmoveto{\pgfqpoint{0cm}{0cm}}
\pgfpathlineto{\pgfqpoint{1.376cm}{0cm}}
\pgfpathlineto{\pgfqpoint{1.376cm}{1.588cm}}
\pgfpathlineto{\pgfqpoint{0cm}{1.588cm}}
\pgfpathclose
\pgfusepath{clip}
\begin{pgfscope}
\begin{pgfscope}
\pgfpathmoveto{\pgfqpoint{0cm}{0cm}}
\pgfpathlineto{\pgfqpoint{1.376cm}{0cm}}
\pgfpathlineto{\pgfqpoint{1.376cm}{1.588cm}}
\pgfpathlineto{\pgfqpoint{0cm}{1.588cm}}
\pgfpathclose
\pgfusepath{clip}
\begin{pgfscope}
\begin{pgfscope}
\definecolor{eps2pgf_color}{gray}{0.976471}\pgfsetstrokecolor{eps2pgf_color}\pgfsetfillcolor{eps2pgf_color}
\pgfpathmoveto{\pgfqpoint{0cm}{0cm}}
\pgfpathlineto{\pgfqpoint{1.376cm}{0cm}}
\pgfpathlineto{\pgfqpoint{1.376cm}{1.588cm}}
\pgfpathlineto{\pgfqpoint{0cm}{1.588cm}}
\pgfpathclose
\pgfusepath{fill}
\end{pgfscope}
\begin{pgfscope}
\pgfsetdash{}{0cm}
\pgfsetlinewidth{0.818mm}
\pgfsetroundcap
\pgfsetroundjoin
\pgfsetmiterlimit{7.0}
\definecolor{eps2pgf_color}{gray}{0}\pgfsetstrokecolor{eps2pgf_color}\pgfsetfillcolor{eps2pgf_color}
\pgfpathmoveto{\pgfqpoint{0.117cm}{1.476cm}}
\pgfpathlineto{\pgfqpoint{0.682cm}{0.726cm}}
\pgfpathlineto{\pgfqpoint{1.246cm}{1.476cm}}
\pgfusepath{stroke}
\end{pgfscope}
\definecolor{eps2pgf_color}{gray}{0}\pgfsetstrokecolor{eps2pgf_color}\pgfsetfillcolor{eps2pgf_color}
\pgfpathmoveto{\pgfqpoint{0.273cm}{1.451cm}}
\pgfpathcurveto{\pgfqpoint{0.273cm}{1.487cm}}{\pgfqpoint{0.259cm}{1.522cm}}{\pgfqpoint{0.233cm}{1.547cm}}
\pgfpathcurveto{\pgfqpoint{0.207cm}{1.573cm}}{\pgfqpoint{0.173cm}{1.588cm}}{\pgfqpoint{0.137cm}{1.588cm}}
\pgfpathcurveto{\pgfqpoint{0.1cm}{1.588cm}}{\pgfqpoint{0.066cm}{1.573cm}}{\pgfqpoint{0.04cm}{1.547cm}}
\pgfpathcurveto{\pgfqpoint{0.014cm}{1.522cm}}{\pgfqpoint{0cm}{1.487cm}}{\pgfqpoint{0cm}{1.451cm}}
\pgfpathcurveto{\pgfqpoint{0cm}{1.414cm}}{\pgfqpoint{0.014cm}{1.379cm}}{\pgfqpoint{0.04cm}{1.354cm}}
\pgfpathcurveto{\pgfqpoint{0.066cm}{1.328cm}}{\pgfqpoint{0.1cm}{1.314cm}}{\pgfqpoint{0.137cm}{1.314cm}}
\pgfpathcurveto{\pgfqpoint{0.173cm}{1.314cm}}{\pgfqpoint{0.207cm}{1.328cm}}{\pgfqpoint{0.233cm}{1.354cm}}
\pgfpathcurveto{\pgfqpoint{0.259cm}{1.379cm}}{\pgfqpoint{0.273cm}{1.414cm}}{\pgfqpoint{0.273cm}{1.451cm}}
\pgfusepath{fill}
\pgfpathmoveto{\pgfqpoint{1.345cm}{1.426cm}}
\pgfpathcurveto{\pgfqpoint{1.345cm}{1.463cm}}{\pgfqpoint{1.331cm}{1.497cm}}{\pgfqpoint{1.305cm}{1.523cm}}
\pgfpathcurveto{\pgfqpoint{1.28cm}{1.549cm}}{\pgfqpoint{1.245cm}{1.563cm}}{\pgfqpoint{1.209cm}{1.563cm}}
\pgfpathcurveto{\pgfqpoint{1.172cm}{1.563cm}}{\pgfqpoint{1.138cm}{1.549cm}}{\pgfqpoint{1.112cm}{1.523cm}}
\pgfpathcurveto{\pgfqpoint{1.087cm}{1.497cm}}{\pgfqpoint{1.072cm}{1.463cm}}{\pgfqpoint{1.072cm}{1.426cm}}
\pgfpathcurveto{\pgfqpoint{1.072cm}{1.39cm}}{\pgfqpoint{1.087cm}{1.355cm}}{\pgfqpoint{1.112cm}{1.329cm}}
\pgfpathcurveto{\pgfqpoint{1.138cm}{1.304cm}}{\pgfqpoint{1.172cm}{1.289cm}}{\pgfqpoint{1.209cm}{1.289cm}}
\pgfpathcurveto{\pgfqpoint{1.245cm}{1.289cm}}{\pgfqpoint{1.28cm}{1.304cm}}{\pgfqpoint{1.305cm}{1.329cm}}
\pgfpathcurveto{\pgfqpoint{1.331cm}{1.355cm}}{\pgfqpoint{1.345cm}{1.39cm}}{\pgfqpoint{1.345cm}{1.426cm}}
\pgfusepath{fill}
\begin{pgfscope}
\pgfsetdash{}{0cm}
\pgfsetlinewidth{0.818mm}
\pgfsetroundcap
\pgfsetmiterlimit{4.0}
\pgfpathmoveto{\pgfqpoint{0.682cm}{0.726cm}}
\pgfpathlineto{\pgfqpoint{0.682cm}{0.097cm}}
\pgfusepath{stroke}
\end{pgfscope}
\end{pgfscope}
\end{pgfscope}
\end{pgfscope}
\end{tikzpicture}}}-(-X^{\!\resizebox{0.6em}{!}{
\begin{tikzpicture}
\pgfpathmoveto{\pgfqpoint{0cm}{-0.035cm}}
\pgfpathlineto{\pgfqpoint{1.376cm}{-0.035cm}}
\pgfpathlineto{\pgfqpoint{1.376cm}{1.552cm}}
\pgfpathlineto{\pgfqpoint{0cm}{1.552cm}}
\pgfpathclose
\pgfusepath{clip}
\begin{pgfscope}
\begin{pgfscope}
\pgfpathmoveto{\pgfqpoint{0cm}{-0.035cm}}
\pgfpathlineto{\pgfqpoint{1.376cm}{-0.035cm}}
\pgfpathlineto{\pgfqpoint{1.376cm}{1.552cm}}
\pgfpathlineto{\pgfqpoint{0cm}{1.552cm}}
\pgfpathclose
\pgfusepath{clip}
\begin{pgfscope}
\begin{pgfscope}
\pgfsetdash{}{0cm}
\pgfsetlinewidth{0.818mm}
\pgfsetroundcap
\pgfsetroundjoin
\pgfsetmiterlimit{7.0}
\definecolor{eps2pgf_color}{gray}{0}\pgfsetstrokecolor{eps2pgf_color}\pgfsetfillcolor{eps2pgf_color}
\pgfpathmoveto{\pgfqpoint{0.117cm}{1.421cm}}
\pgfpathlineto{\pgfqpoint{0.682cm}{0.671cm}}
\pgfpathlineto{\pgfqpoint{1.246cm}{1.421cm}}
\pgfusepath{stroke}
\end{pgfscope}
\definecolor{eps2pgf_color}{gray}{0}\pgfsetstrokecolor{eps2pgf_color}\pgfsetfillcolor{eps2pgf_color}
\pgfpathmoveto{\pgfqpoint{0.273cm}{1.395cm}}
\pgfpathcurveto{\pgfqpoint{0.273cm}{1.432cm}}{\pgfqpoint{0.259cm}{1.467cm}}{\pgfqpoint{0.233cm}{1.492cm}}
\pgfpathcurveto{\pgfqpoint{0.207cm}{1.518cm}}{\pgfqpoint{0.173cm}{1.532cm}}{\pgfqpoint{0.137cm}{1.532cm}}
\pgfpathcurveto{\pgfqpoint{0.1cm}{1.532cm}}{\pgfqpoint{0.066cm}{1.518cm}}{\pgfqpoint{0.04cm}{1.492cm}}
\pgfpathcurveto{\pgfqpoint{0.014cm}{1.467cm}}{\pgfqpoint{0cm}{1.432cm}}{\pgfqpoint{0cm}{1.395cm}}
\pgfpathcurveto{\pgfqpoint{0cm}{1.359cm}}{\pgfqpoint{0.014cm}{1.324cm}}{\pgfqpoint{0.04cm}{1.299cm}}
\pgfpathcurveto{\pgfqpoint{0.066cm}{1.273cm}}{\pgfqpoint{0.1cm}{1.258cm}}{\pgfqpoint{0.137cm}{1.258cm}}
\pgfpathcurveto{\pgfqpoint{0.173cm}{1.258cm}}{\pgfqpoint{0.207cm}{1.273cm}}{\pgfqpoint{0.233cm}{1.299cm}}
\pgfpathcurveto{\pgfqpoint{0.259cm}{1.324cm}}{\pgfqpoint{0.273cm}{1.359cm}}{\pgfqpoint{0.273cm}{1.395cm}}
\pgfusepath{fill}
\begin{pgfscope}
\pgfsetdash{}{0cm}
\pgfsetlinewidth{0.818mm}
\pgfsetmiterlimit{7.0}
\pgfpathmoveto{\pgfqpoint{0.682cm}{0.671cm}}
\pgfpathlineto{\pgfqpoint{0.679cm}{1.418cm}}
\pgfusepath{stroke}
\end{pgfscope}
\pgfpathmoveto{\pgfqpoint{0.815cm}{1.399cm}}
\pgfpathcurveto{\pgfqpoint{0.815cm}{1.435cm}}{\pgfqpoint{0.801cm}{1.47cm}}{\pgfqpoint{0.775cm}{1.496cm}}
\pgfpathcurveto{\pgfqpoint{0.75cm}{1.521cm}}{\pgfqpoint{0.715cm}{1.536cm}}{\pgfqpoint{0.679cm}{1.536cm}}
\pgfpathcurveto{\pgfqpoint{0.643cm}{1.536cm}}{\pgfqpoint{0.608cm}{1.521cm}}{\pgfqpoint{0.582cm}{1.496cm}}
\pgfpathcurveto{\pgfqpoint{0.557cm}{1.47cm}}{\pgfqpoint{0.542cm}{1.435cm}}{\pgfqpoint{0.542cm}{1.399cm}}
\pgfpathcurveto{\pgfqpoint{0.542cm}{1.363cm}}{\pgfqpoint{0.557cm}{1.328cm}}{\pgfqpoint{0.582cm}{1.302cm}}
\pgfpathcurveto{\pgfqpoint{0.608cm}{1.276cm}}{\pgfqpoint{0.643cm}{1.262cm}}{\pgfqpoint{0.679cm}{1.262cm}}
\pgfpathcurveto{\pgfqpoint{0.715cm}{1.262cm}}{\pgfqpoint{0.75cm}{1.276cm}}{\pgfqpoint{0.775cm}{1.302cm}}
\pgfpathcurveto{\pgfqpoint{0.801cm}{1.328cm}}{\pgfqpoint{0.815cm}{1.363cm}}{\pgfqpoint{0.815cm}{1.399cm}}
\pgfusepath{fill}
\pgfpathmoveto{\pgfqpoint{1.345cm}{1.371cm}}
\pgfpathcurveto{\pgfqpoint{1.345cm}{1.408cm}}{\pgfqpoint{1.331cm}{1.442cm}}{\pgfqpoint{1.305cm}{1.468cm}}
\pgfpathcurveto{\pgfqpoint{1.28cm}{1.494cm}}{\pgfqpoint{1.245cm}{1.508cm}}{\pgfqpoint{1.209cm}{1.508cm}}
\pgfpathcurveto{\pgfqpoint{1.172cm}{1.508cm}}{\pgfqpoint{1.138cm}{1.494cm}}{\pgfqpoint{1.112cm}{1.468cm}}
\pgfpathcurveto{\pgfqpoint{1.087cm}{1.442cm}}{\pgfqpoint{1.072cm}{1.408cm}}{\pgfqpoint{1.072cm}{1.371cm}}
\pgfpathcurveto{\pgfqpoint{1.072cm}{1.335cm}}{\pgfqpoint{1.087cm}{1.3cm}}{\pgfqpoint{1.112cm}{1.274cm}}
\pgfpathcurveto{\pgfqpoint{1.138cm}{1.249cm}}{\pgfqpoint{1.172cm}{1.234cm}}{\pgfqpoint{1.209cm}{1.234cm}}
\pgfpathcurveto{\pgfqpoint{1.245cm}{1.234cm}}{\pgfqpoint{1.28cm}{1.249cm}}{\pgfqpoint{1.305cm}{1.274cm}}
\pgfpathcurveto{\pgfqpoint{1.331cm}{1.3cm}}{\pgfqpoint{1.345cm}{1.335cm}}{\pgfqpoint{1.345cm}{1.371cm}}
\pgfusepath{fill}
\begin{pgfscope}
\pgfsetdash{}{0cm}
\pgfsetlinewidth{0.818mm}
\pgfsetroundcap
\pgfsetmiterlimit{4.0}
\pgfpathmoveto{\pgfqpoint{0.682cm}{0.671cm}}
\pgfpathlineto{\pgfqpoint{0.682cm}{0.042cm}}
\pgfusepath{stroke}
\end{pgfscope}
\end{pgfscope}
\end{pgfscope}
\end{pgfscope}
\end{tikzpicture}}}+\phi+\psi)\prec X^{\!\resizebox{0.6em}{!}{
\begin{tikzpicture}
\pgfpathmoveto{\pgfqpoint{0cm}{0cm}}
\pgfpathlineto{\pgfqpoint{1.376cm}{0cm}}
\pgfpathlineto{\pgfqpoint{1.376cm}{1.588cm}}
\pgfpathlineto{\pgfqpoint{0cm}{1.588cm}}
\pgfpathclose
\pgfusepath{clip}
\begin{pgfscope}
\begin{pgfscope}
\pgfpathmoveto{\pgfqpoint{0cm}{0cm}}
\pgfpathlineto{\pgfqpoint{1.376cm}{0cm}}
\pgfpathlineto{\pgfqpoint{1.376cm}{1.588cm}}
\pgfpathlineto{\pgfqpoint{0cm}{1.588cm}}
\pgfpathclose
\pgfusepath{clip}
\begin{pgfscope}
\begin{pgfscope}
\definecolor{eps2pgf_color}{gray}{0.976471}\pgfsetstrokecolor{eps2pgf_color}\pgfsetfillcolor{eps2pgf_color}
\pgfpathmoveto{\pgfqpoint{0cm}{0cm}}
\pgfpathlineto{\pgfqpoint{1.376cm}{0cm}}
\pgfpathlineto{\pgfqpoint{1.376cm}{1.588cm}}
\pgfpathlineto{\pgfqpoint{0cm}{1.588cm}}
\pgfpathclose
\pgfusepath{fill}
\end{pgfscope}
\begin{pgfscope}
\pgfsetdash{}{0cm}
\pgfsetlinewidth{0.818mm}
\pgfsetroundcap
\pgfsetroundjoin
\pgfsetmiterlimit{7.0}
\definecolor{eps2pgf_color}{gray}{0}\pgfsetstrokecolor{eps2pgf_color}\pgfsetfillcolor{eps2pgf_color}
\pgfpathmoveto{\pgfqpoint{0.117cm}{1.476cm}}
\pgfpathlineto{\pgfqpoint{0.682cm}{0.726cm}}
\pgfpathlineto{\pgfqpoint{1.246cm}{1.476cm}}
\pgfusepath{stroke}
\end{pgfscope}
\definecolor{eps2pgf_color}{gray}{0}\pgfsetstrokecolor{eps2pgf_color}\pgfsetfillcolor{eps2pgf_color}
\pgfpathmoveto{\pgfqpoint{0.273cm}{1.451cm}}
\pgfpathcurveto{\pgfqpoint{0.273cm}{1.487cm}}{\pgfqpoint{0.259cm}{1.522cm}}{\pgfqpoint{0.233cm}{1.547cm}}
\pgfpathcurveto{\pgfqpoint{0.207cm}{1.573cm}}{\pgfqpoint{0.173cm}{1.588cm}}{\pgfqpoint{0.137cm}{1.588cm}}
\pgfpathcurveto{\pgfqpoint{0.1cm}{1.588cm}}{\pgfqpoint{0.066cm}{1.573cm}}{\pgfqpoint{0.04cm}{1.547cm}}
\pgfpathcurveto{\pgfqpoint{0.014cm}{1.522cm}}{\pgfqpoint{0cm}{1.487cm}}{\pgfqpoint{0cm}{1.451cm}}
\pgfpathcurveto{\pgfqpoint{0cm}{1.414cm}}{\pgfqpoint{0.014cm}{1.379cm}}{\pgfqpoint{0.04cm}{1.354cm}}
\pgfpathcurveto{\pgfqpoint{0.066cm}{1.328cm}}{\pgfqpoint{0.1cm}{1.314cm}}{\pgfqpoint{0.137cm}{1.314cm}}
\pgfpathcurveto{\pgfqpoint{0.173cm}{1.314cm}}{\pgfqpoint{0.207cm}{1.328cm}}{\pgfqpoint{0.233cm}{1.354cm}}
\pgfpathcurveto{\pgfqpoint{0.259cm}{1.379cm}}{\pgfqpoint{0.273cm}{1.414cm}}{\pgfqpoint{0.273cm}{1.451cm}}
\pgfusepath{fill}
\pgfpathmoveto{\pgfqpoint{1.345cm}{1.426cm}}
\pgfpathcurveto{\pgfqpoint{1.345cm}{1.463cm}}{\pgfqpoint{1.331cm}{1.497cm}}{\pgfqpoint{1.305cm}{1.523cm}}
\pgfpathcurveto{\pgfqpoint{1.28cm}{1.549cm}}{\pgfqpoint{1.245cm}{1.563cm}}{\pgfqpoint{1.209cm}{1.563cm}}
\pgfpathcurveto{\pgfqpoint{1.172cm}{1.563cm}}{\pgfqpoint{1.138cm}{1.549cm}}{\pgfqpoint{1.112cm}{1.523cm}}
\pgfpathcurveto{\pgfqpoint{1.087cm}{1.497cm}}{\pgfqpoint{1.072cm}{1.463cm}}{\pgfqpoint{1.072cm}{1.426cm}}
\pgfpathcurveto{\pgfqpoint{1.072cm}{1.39cm}}{\pgfqpoint{1.087cm}{1.355cm}}{\pgfqpoint{1.112cm}{1.329cm}}
\pgfpathcurveto{\pgfqpoint{1.138cm}{1.304cm}}{\pgfqpoint{1.172cm}{1.289cm}}{\pgfqpoint{1.209cm}{1.289cm}}
\pgfpathcurveto{\pgfqpoint{1.245cm}{1.289cm}}{\pgfqpoint{1.28cm}{1.304cm}}{\pgfqpoint{1.305cm}{1.329cm}}
\pgfpathcurveto{\pgfqpoint{1.331cm}{1.355cm}}{\pgfqpoint{1.345cm}{1.39cm}}{\pgfqpoint{1.345cm}{1.426cm}}
\pgfusepath{fill}
\begin{pgfscope}
\pgfsetdash{}{0cm}
\pgfsetlinewidth{0.818mm}
\pgfsetroundcap
\pgfsetmiterlimit{4.0}
\pgfpathmoveto{\pgfqpoint{0.682cm}{0.726cm}}
\pgfpathlineto{\pgfqpoint{0.682cm}{0.097cm}}
\pgfusepath{stroke}
\end{pgfscope}
\end{pgfscope}
\end{pgfscope}
\end{pgfscope}
\end{tikzpicture}}}]\|_{C\CC^{\gamma}(\rho^{3+\gamma})}\\
&\quad\lesssim \|\llbracket X^{2} \rrbracket\|_{C\CC^{-1-\kappa}(\rho^{\sigma})}\|-X^{\!\resizebox{0.6em}{!}{
\begin{tikzpicture}
\pgfpathmoveto{\pgfqpoint{0cm}{-0.035cm}}
\pgfpathlineto{\pgfqpoint{1.376cm}{-0.035cm}}
\pgfpathlineto{\pgfqpoint{1.376cm}{1.552cm}}
\pgfpathlineto{\pgfqpoint{0cm}{1.552cm}}
\pgfpathclose
\pgfusepath{clip}
\begin{pgfscope}
\begin{pgfscope}
\pgfpathmoveto{\pgfqpoint{0cm}{-0.035cm}}
\pgfpathlineto{\pgfqpoint{1.376cm}{-0.035cm}}
\pgfpathlineto{\pgfqpoint{1.376cm}{1.552cm}}
\pgfpathlineto{\pgfqpoint{0cm}{1.552cm}}
\pgfpathclose
\pgfusepath{clip}
\begin{pgfscope}
\begin{pgfscope}
\pgfsetdash{}{0cm}
\pgfsetlinewidth{0.818mm}
\pgfsetroundcap
\pgfsetroundjoin
\pgfsetmiterlimit{7.0}
\definecolor{eps2pgf_color}{gray}{0}\pgfsetstrokecolor{eps2pgf_color}\pgfsetfillcolor{eps2pgf_color}
\pgfpathmoveto{\pgfqpoint{0.117cm}{1.421cm}}
\pgfpathlineto{\pgfqpoint{0.682cm}{0.671cm}}
\pgfpathlineto{\pgfqpoint{1.246cm}{1.421cm}}
\pgfusepath{stroke}
\end{pgfscope}
\definecolor{eps2pgf_color}{gray}{0}\pgfsetstrokecolor{eps2pgf_color}\pgfsetfillcolor{eps2pgf_color}
\pgfpathmoveto{\pgfqpoint{0.273cm}{1.395cm}}
\pgfpathcurveto{\pgfqpoint{0.273cm}{1.432cm}}{\pgfqpoint{0.259cm}{1.467cm}}{\pgfqpoint{0.233cm}{1.492cm}}
\pgfpathcurveto{\pgfqpoint{0.207cm}{1.518cm}}{\pgfqpoint{0.173cm}{1.532cm}}{\pgfqpoint{0.137cm}{1.532cm}}
\pgfpathcurveto{\pgfqpoint{0.1cm}{1.532cm}}{\pgfqpoint{0.066cm}{1.518cm}}{\pgfqpoint{0.04cm}{1.492cm}}
\pgfpathcurveto{\pgfqpoint{0.014cm}{1.467cm}}{\pgfqpoint{0cm}{1.432cm}}{\pgfqpoint{0cm}{1.395cm}}
\pgfpathcurveto{\pgfqpoint{0cm}{1.359cm}}{\pgfqpoint{0.014cm}{1.324cm}}{\pgfqpoint{0.04cm}{1.299cm}}
\pgfpathcurveto{\pgfqpoint{0.066cm}{1.273cm}}{\pgfqpoint{0.1cm}{1.258cm}}{\pgfqpoint{0.137cm}{1.258cm}}
\pgfpathcurveto{\pgfqpoint{0.173cm}{1.258cm}}{\pgfqpoint{0.207cm}{1.273cm}}{\pgfqpoint{0.233cm}{1.299cm}}
\pgfpathcurveto{\pgfqpoint{0.259cm}{1.324cm}}{\pgfqpoint{0.273cm}{1.359cm}}{\pgfqpoint{0.273cm}{1.395cm}}
\pgfusepath{fill}
\begin{pgfscope}
\pgfsetdash{}{0cm}
\pgfsetlinewidth{0.818mm}
\pgfsetmiterlimit{7.0}
\pgfpathmoveto{\pgfqpoint{0.682cm}{0.671cm}}
\pgfpathlineto{\pgfqpoint{0.679cm}{1.418cm}}
\pgfusepath{stroke}
\end{pgfscope}
\pgfpathmoveto{\pgfqpoint{0.815cm}{1.399cm}}
\pgfpathcurveto{\pgfqpoint{0.815cm}{1.435cm}}{\pgfqpoint{0.801cm}{1.47cm}}{\pgfqpoint{0.775cm}{1.496cm}}
\pgfpathcurveto{\pgfqpoint{0.75cm}{1.521cm}}{\pgfqpoint{0.715cm}{1.536cm}}{\pgfqpoint{0.679cm}{1.536cm}}
\pgfpathcurveto{\pgfqpoint{0.643cm}{1.536cm}}{\pgfqpoint{0.608cm}{1.521cm}}{\pgfqpoint{0.582cm}{1.496cm}}
\pgfpathcurveto{\pgfqpoint{0.557cm}{1.47cm}}{\pgfqpoint{0.542cm}{1.435cm}}{\pgfqpoint{0.542cm}{1.399cm}}
\pgfpathcurveto{\pgfqpoint{0.542cm}{1.363cm}}{\pgfqpoint{0.557cm}{1.328cm}}{\pgfqpoint{0.582cm}{1.302cm}}
\pgfpathcurveto{\pgfqpoint{0.608cm}{1.276cm}}{\pgfqpoint{0.643cm}{1.262cm}}{\pgfqpoint{0.679cm}{1.262cm}}
\pgfpathcurveto{\pgfqpoint{0.715cm}{1.262cm}}{\pgfqpoint{0.75cm}{1.276cm}}{\pgfqpoint{0.775cm}{1.302cm}}
\pgfpathcurveto{\pgfqpoint{0.801cm}{1.328cm}}{\pgfqpoint{0.815cm}{1.363cm}}{\pgfqpoint{0.815cm}{1.399cm}}
\pgfusepath{fill}
\pgfpathmoveto{\pgfqpoint{1.345cm}{1.371cm}}
\pgfpathcurveto{\pgfqpoint{1.345cm}{1.408cm}}{\pgfqpoint{1.331cm}{1.442cm}}{\pgfqpoint{1.305cm}{1.468cm}}
\pgfpathcurveto{\pgfqpoint{1.28cm}{1.494cm}}{\pgfqpoint{1.245cm}{1.508cm}}{\pgfqpoint{1.209cm}{1.508cm}}
\pgfpathcurveto{\pgfqpoint{1.172cm}{1.508cm}}{\pgfqpoint{1.138cm}{1.494cm}}{\pgfqpoint{1.112cm}{1.468cm}}
\pgfpathcurveto{\pgfqpoint{1.087cm}{1.442cm}}{\pgfqpoint{1.072cm}{1.408cm}}{\pgfqpoint{1.072cm}{1.371cm}}
\pgfpathcurveto{\pgfqpoint{1.072cm}{1.335cm}}{\pgfqpoint{1.087cm}{1.3cm}}{\pgfqpoint{1.112cm}{1.274cm}}
\pgfpathcurveto{\pgfqpoint{1.138cm}{1.249cm}}{\pgfqpoint{1.172cm}{1.234cm}}{\pgfqpoint{1.209cm}{1.234cm}}
\pgfpathcurveto{\pgfqpoint{1.245cm}{1.234cm}}{\pgfqpoint{1.28cm}{1.249cm}}{\pgfqpoint{1.305cm}{1.274cm}}
\pgfpathcurveto{\pgfqpoint{1.331cm}{1.3cm}}{\pgfqpoint{1.345cm}{1.335cm}}{\pgfqpoint{1.345cm}{1.371cm}}
\pgfusepath{fill}
\begin{pgfscope}
\pgfsetdash{}{0cm}
\pgfsetlinewidth{0.818mm}
\pgfsetroundcap
\pgfsetmiterlimit{4.0}
\pgfpathmoveto{\pgfqpoint{0.682cm}{0.671cm}}
\pgfpathlineto{\pgfqpoint{0.682cm}{0.042cm}}
\pgfusepath{stroke}
\end{pgfscope}
\end{pgfscope}
\end{pgfscope}
\end{pgfscope}
\end{tikzpicture}}}+\phi+\psi\|_{C^{(\alpha+\kappa)/2}L^{\infty}(\rho^{3+\gamma'})} \|X^{\!\resizebox{0.6em}{!}{
\begin{tikzpicture}
\pgfpathmoveto{\pgfqpoint{0cm}{0cm}}
\pgfpathlineto{\pgfqpoint{1.376cm}{0cm}}
\pgfpathlineto{\pgfqpoint{1.376cm}{1.588cm}}
\pgfpathlineto{\pgfqpoint{0cm}{1.588cm}}
\pgfpathclose
\pgfusepath{clip}
\begin{pgfscope}
\begin{pgfscope}
\pgfpathmoveto{\pgfqpoint{0cm}{0cm}}
\pgfpathlineto{\pgfqpoint{1.376cm}{0cm}}
\pgfpathlineto{\pgfqpoint{1.376cm}{1.588cm}}
\pgfpathlineto{\pgfqpoint{0cm}{1.588cm}}
\pgfpathclose
\pgfusepath{clip}
\begin{pgfscope}
\begin{pgfscope}
\definecolor{eps2pgf_color}{gray}{0.976471}\pgfsetstrokecolor{eps2pgf_color}\pgfsetfillcolor{eps2pgf_color}
\pgfpathmoveto{\pgfqpoint{0cm}{0cm}}
\pgfpathlineto{\pgfqpoint{1.376cm}{0cm}}
\pgfpathlineto{\pgfqpoint{1.376cm}{1.588cm}}
\pgfpathlineto{\pgfqpoint{0cm}{1.588cm}}
\pgfpathclose
\pgfusepath{fill}
\end{pgfscope}
\begin{pgfscope}
\pgfsetdash{}{0cm}
\pgfsetlinewidth{0.818mm}
\pgfsetroundcap
\pgfsetroundjoin
\pgfsetmiterlimit{7.0}
\definecolor{eps2pgf_color}{gray}{0}\pgfsetstrokecolor{eps2pgf_color}\pgfsetfillcolor{eps2pgf_color}
\pgfpathmoveto{\pgfqpoint{0.117cm}{1.476cm}}
\pgfpathlineto{\pgfqpoint{0.682cm}{0.726cm}}
\pgfpathlineto{\pgfqpoint{1.246cm}{1.476cm}}
\pgfusepath{stroke}
\end{pgfscope}
\definecolor{eps2pgf_color}{gray}{0}\pgfsetstrokecolor{eps2pgf_color}\pgfsetfillcolor{eps2pgf_color}
\pgfpathmoveto{\pgfqpoint{0.273cm}{1.451cm}}
\pgfpathcurveto{\pgfqpoint{0.273cm}{1.487cm}}{\pgfqpoint{0.259cm}{1.522cm}}{\pgfqpoint{0.233cm}{1.547cm}}
\pgfpathcurveto{\pgfqpoint{0.207cm}{1.573cm}}{\pgfqpoint{0.173cm}{1.588cm}}{\pgfqpoint{0.137cm}{1.588cm}}
\pgfpathcurveto{\pgfqpoint{0.1cm}{1.588cm}}{\pgfqpoint{0.066cm}{1.573cm}}{\pgfqpoint{0.04cm}{1.547cm}}
\pgfpathcurveto{\pgfqpoint{0.014cm}{1.522cm}}{\pgfqpoint{0cm}{1.487cm}}{\pgfqpoint{0cm}{1.451cm}}
\pgfpathcurveto{\pgfqpoint{0cm}{1.414cm}}{\pgfqpoint{0.014cm}{1.379cm}}{\pgfqpoint{0.04cm}{1.354cm}}
\pgfpathcurveto{\pgfqpoint{0.066cm}{1.328cm}}{\pgfqpoint{0.1cm}{1.314cm}}{\pgfqpoint{0.137cm}{1.314cm}}
\pgfpathcurveto{\pgfqpoint{0.173cm}{1.314cm}}{\pgfqpoint{0.207cm}{1.328cm}}{\pgfqpoint{0.233cm}{1.354cm}}
\pgfpathcurveto{\pgfqpoint{0.259cm}{1.379cm}}{\pgfqpoint{0.273cm}{1.414cm}}{\pgfqpoint{0.273cm}{1.451cm}}
\pgfusepath{fill}
\pgfpathmoveto{\pgfqpoint{1.345cm}{1.426cm}}
\pgfpathcurveto{\pgfqpoint{1.345cm}{1.463cm}}{\pgfqpoint{1.331cm}{1.497cm}}{\pgfqpoint{1.305cm}{1.523cm}}
\pgfpathcurveto{\pgfqpoint{1.28cm}{1.549cm}}{\pgfqpoint{1.245cm}{1.563cm}}{\pgfqpoint{1.209cm}{1.563cm}}
\pgfpathcurveto{\pgfqpoint{1.172cm}{1.563cm}}{\pgfqpoint{1.138cm}{1.549cm}}{\pgfqpoint{1.112cm}{1.523cm}}
\pgfpathcurveto{\pgfqpoint{1.087cm}{1.497cm}}{\pgfqpoint{1.072cm}{1.463cm}}{\pgfqpoint{1.072cm}{1.426cm}}
\pgfpathcurveto{\pgfqpoint{1.072cm}{1.39cm}}{\pgfqpoint{1.087cm}{1.355cm}}{\pgfqpoint{1.112cm}{1.329cm}}
\pgfpathcurveto{\pgfqpoint{1.138cm}{1.304cm}}{\pgfqpoint{1.172cm}{1.289cm}}{\pgfqpoint{1.209cm}{1.289cm}}
\pgfpathcurveto{\pgfqpoint{1.245cm}{1.289cm}}{\pgfqpoint{1.28cm}{1.304cm}}{\pgfqpoint{1.305cm}{1.329cm}}
\pgfpathcurveto{\pgfqpoint{1.331cm}{1.355cm}}{\pgfqpoint{1.345cm}{1.39cm}}{\pgfqpoint{1.345cm}{1.426cm}}
\pgfusepath{fill}
\begin{pgfscope}
\pgfsetdash{}{0cm}
\pgfsetlinewidth{0.818mm}
\pgfsetroundcap
\pgfsetmiterlimit{4.0}
\pgfpathmoveto{\pgfqpoint{0.682cm}{0.726cm}}
\pgfpathlineto{\pgfqpoint{0.682cm}{0.097cm}}
\pgfusepath{stroke}
\end{pgfscope}
\end{pgfscope}
\end{pgfscope}
\end{pgfscope}
\end{tikzpicture}}}\|_{C\CC^{1-\kappa}(\rho^{\sigma})}\\
&\quad\lesssim 1+\|\phi+\psi\|_{C^{(\alpha+\kappa)/2}L^{\infty}(\rho^{3+\gamma'})}.
 \end{align*}

 The equation for $\vartheta$ now reads as
 \begin{align*}
 \LL \vartheta + \Theta = 0,\quad\vartheta(0)=\phi_{0},
 \end{align*}
with
\begin{align*}
\Theta&=\rmm{\Phi}+\rmm{3\llbracket X^2 \rrbracket\succX^{\!\resizebox{0.6em}{!}{
\begin{tikzpicture}
\pgfpathmoveto{\pgfqpoint{0cm}{-0.035cm}}
\pgfpathlineto{\pgfqpoint{1.376cm}{-0.035cm}}
\pgfpathlineto{\pgfqpoint{1.376cm}{1.552cm}}
\pgfpathlineto{\pgfqpoint{0cm}{1.552cm}}
\pgfpathclose
\pgfusepath{clip}
\begin{pgfscope}
\begin{pgfscope}
\pgfpathmoveto{\pgfqpoint{0cm}{-0.035cm}}
\pgfpathlineto{\pgfqpoint{1.376cm}{-0.035cm}}
\pgfpathlineto{\pgfqpoint{1.376cm}{1.552cm}}
\pgfpathlineto{\pgfqpoint{0cm}{1.552cm}}
\pgfpathclose
\pgfusepath{clip}
\begin{pgfscope}
\begin{pgfscope}
\pgfsetdash{}{0cm}
\pgfsetlinewidth{0.818mm}
\pgfsetroundcap
\pgfsetroundjoin
\pgfsetmiterlimit{7.0}
\definecolor{eps2pgf_color}{gray}{0}\pgfsetstrokecolor{eps2pgf_color}\pgfsetfillcolor{eps2pgf_color}
\pgfpathmoveto{\pgfqpoint{0.117cm}{1.421cm}}
\pgfpathlineto{\pgfqpoint{0.682cm}{0.671cm}}
\pgfpathlineto{\pgfqpoint{1.246cm}{1.421cm}}
\pgfusepath{stroke}
\end{pgfscope}
\definecolor{eps2pgf_color}{gray}{0}\pgfsetstrokecolor{eps2pgf_color}\pgfsetfillcolor{eps2pgf_color}
\pgfpathmoveto{\pgfqpoint{0.273cm}{1.395cm}}
\pgfpathcurveto{\pgfqpoint{0.273cm}{1.432cm}}{\pgfqpoint{0.259cm}{1.467cm}}{\pgfqpoint{0.233cm}{1.492cm}}
\pgfpathcurveto{\pgfqpoint{0.207cm}{1.518cm}}{\pgfqpoint{0.173cm}{1.532cm}}{\pgfqpoint{0.137cm}{1.532cm}}
\pgfpathcurveto{\pgfqpoint{0.1cm}{1.532cm}}{\pgfqpoint{0.066cm}{1.518cm}}{\pgfqpoint{0.04cm}{1.492cm}}
\pgfpathcurveto{\pgfqpoint{0.014cm}{1.467cm}}{\pgfqpoint{0cm}{1.432cm}}{\pgfqpoint{0cm}{1.395cm}}
\pgfpathcurveto{\pgfqpoint{0cm}{1.359cm}}{\pgfqpoint{0.014cm}{1.324cm}}{\pgfqpoint{0.04cm}{1.299cm}}
\pgfpathcurveto{\pgfqpoint{0.066cm}{1.273cm}}{\pgfqpoint{0.1cm}{1.258cm}}{\pgfqpoint{0.137cm}{1.258cm}}
\pgfpathcurveto{\pgfqpoint{0.173cm}{1.258cm}}{\pgfqpoint{0.207cm}{1.273cm}}{\pgfqpoint{0.233cm}{1.299cm}}
\pgfpathcurveto{\pgfqpoint{0.259cm}{1.324cm}}{\pgfqpoint{0.273cm}{1.359cm}}{\pgfqpoint{0.273cm}{1.395cm}}
\pgfusepath{fill}
\begin{pgfscope}
\pgfsetdash{}{0cm}
\pgfsetlinewidth{0.818mm}
\pgfsetmiterlimit{7.0}
\pgfpathmoveto{\pgfqpoint{0.682cm}{0.671cm}}
\pgfpathlineto{\pgfqpoint{0.679cm}{1.418cm}}
\pgfusepath{stroke}
\end{pgfscope}
\pgfpathmoveto{\pgfqpoint{0.815cm}{1.399cm}}
\pgfpathcurveto{\pgfqpoint{0.815cm}{1.435cm}}{\pgfqpoint{0.801cm}{1.47cm}}{\pgfqpoint{0.775cm}{1.496cm}}
\pgfpathcurveto{\pgfqpoint{0.75cm}{1.521cm}}{\pgfqpoint{0.715cm}{1.536cm}}{\pgfqpoint{0.679cm}{1.536cm}}
\pgfpathcurveto{\pgfqpoint{0.643cm}{1.536cm}}{\pgfqpoint{0.608cm}{1.521cm}}{\pgfqpoint{0.582cm}{1.496cm}}
\pgfpathcurveto{\pgfqpoint{0.557cm}{1.47cm}}{\pgfqpoint{0.542cm}{1.435cm}}{\pgfqpoint{0.542cm}{1.399cm}}
\pgfpathcurveto{\pgfqpoint{0.542cm}{1.363cm}}{\pgfqpoint{0.557cm}{1.328cm}}{\pgfqpoint{0.582cm}{1.302cm}}
\pgfpathcurveto{\pgfqpoint{0.608cm}{1.276cm}}{\pgfqpoint{0.643cm}{1.262cm}}{\pgfqpoint{0.679cm}{1.262cm}}
\pgfpathcurveto{\pgfqpoint{0.715cm}{1.262cm}}{\pgfqpoint{0.75cm}{1.276cm}}{\pgfqpoint{0.775cm}{1.302cm}}
\pgfpathcurveto{\pgfqpoint{0.801cm}{1.328cm}}{\pgfqpoint{0.815cm}{1.363cm}}{\pgfqpoint{0.815cm}{1.399cm}}
\pgfusepath{fill}
\pgfpathmoveto{\pgfqpoint{1.345cm}{1.371cm}}
\pgfpathcurveto{\pgfqpoint{1.345cm}{1.408cm}}{\pgfqpoint{1.331cm}{1.442cm}}{\pgfqpoint{1.305cm}{1.468cm}}
\pgfpathcurveto{\pgfqpoint{1.28cm}{1.494cm}}{\pgfqpoint{1.245cm}{1.508cm}}{\pgfqpoint{1.209cm}{1.508cm}}
\pgfpathcurveto{\pgfqpoint{1.172cm}{1.508cm}}{\pgfqpoint{1.138cm}{1.494cm}}{\pgfqpoint{1.112cm}{1.468cm}}
\pgfpathcurveto{\pgfqpoint{1.087cm}{1.442cm}}{\pgfqpoint{1.072cm}{1.408cm}}{\pgfqpoint{1.072cm}{1.371cm}}
\pgfpathcurveto{\pgfqpoint{1.072cm}{1.335cm}}{\pgfqpoint{1.087cm}{1.3cm}}{\pgfqpoint{1.112cm}{1.274cm}}
\pgfpathcurveto{\pgfqpoint{1.138cm}{1.249cm}}{\pgfqpoint{1.172cm}{1.234cm}}{\pgfqpoint{1.209cm}{1.234cm}}
\pgfpathcurveto{\pgfqpoint{1.245cm}{1.234cm}}{\pgfqpoint{1.28cm}{1.249cm}}{\pgfqpoint{1.305cm}{1.274cm}}
\pgfpathcurveto{\pgfqpoint{1.331cm}{1.3cm}}{\pgfqpoint{1.345cm}{1.335cm}}{\pgfqpoint{1.345cm}{1.371cm}}
\pgfusepath{fill}
\begin{pgfscope}
\pgfsetdash{}{0cm}
\pgfsetlinewidth{0.818mm}
\pgfsetroundcap
\pgfsetmiterlimit{4.0}
\pgfpathmoveto{\pgfqpoint{0.682cm}{0.671cm}}
\pgfpathlineto{\pgfqpoint{0.682cm}{0.042cm}}
\pgfusepath{stroke}
\end{pgfscope}
\end{pgfscope}
\end{pgfscope}
\end{pgfscope}
\end{tikzpicture}}}}-\rmm{3\VV_>\llbracket X^2 \rrbracket\succ(\phi+\psi)}+\rmb{3\VV_\leq\llbracket X^2 \rrbracket\succ(\phi+\psi)}\\
&\quad -3\Big( (-X^{\!\resizebox{0.6em}{!}{
\begin{tikzpicture}
\pgfpathmoveto{\pgfqpoint{0cm}{-0.035cm}}
\pgfpathlineto{\pgfqpoint{1.376cm}{-0.035cm}}
\pgfpathlineto{\pgfqpoint{1.376cm}{1.552cm}}
\pgfpathlineto{\pgfqpoint{0cm}{1.552cm}}
\pgfpathclose
\pgfusepath{clip}
\begin{pgfscope}
\begin{pgfscope}
\pgfpathmoveto{\pgfqpoint{0cm}{-0.035cm}}
\pgfpathlineto{\pgfqpoint{1.376cm}{-0.035cm}}
\pgfpathlineto{\pgfqpoint{1.376cm}{1.552cm}}
\pgfpathlineto{\pgfqpoint{0cm}{1.552cm}}
\pgfpathclose
\pgfusepath{clip}
\begin{pgfscope}
\begin{pgfscope}
\pgfsetdash{}{0cm}
\pgfsetlinewidth{0.818mm}
\pgfsetroundcap
\pgfsetroundjoin
\pgfsetmiterlimit{7.0}
\definecolor{eps2pgf_color}{gray}{0}\pgfsetstrokecolor{eps2pgf_color}\pgfsetfillcolor{eps2pgf_color}
\pgfpathmoveto{\pgfqpoint{0.117cm}{1.421cm}}
\pgfpathlineto{\pgfqpoint{0.682cm}{0.671cm}}
\pgfpathlineto{\pgfqpoint{1.246cm}{1.421cm}}
\pgfusepath{stroke}
\end{pgfscope}
\definecolor{eps2pgf_color}{gray}{0}\pgfsetstrokecolor{eps2pgf_color}\pgfsetfillcolor{eps2pgf_color}
\pgfpathmoveto{\pgfqpoint{0.273cm}{1.395cm}}
\pgfpathcurveto{\pgfqpoint{0.273cm}{1.432cm}}{\pgfqpoint{0.259cm}{1.467cm}}{\pgfqpoint{0.233cm}{1.492cm}}
\pgfpathcurveto{\pgfqpoint{0.207cm}{1.518cm}}{\pgfqpoint{0.173cm}{1.532cm}}{\pgfqpoint{0.137cm}{1.532cm}}
\pgfpathcurveto{\pgfqpoint{0.1cm}{1.532cm}}{\pgfqpoint{0.066cm}{1.518cm}}{\pgfqpoint{0.04cm}{1.492cm}}
\pgfpathcurveto{\pgfqpoint{0.014cm}{1.467cm}}{\pgfqpoint{0cm}{1.432cm}}{\pgfqpoint{0cm}{1.395cm}}
\pgfpathcurveto{\pgfqpoint{0cm}{1.359cm}}{\pgfqpoint{0.014cm}{1.324cm}}{\pgfqpoint{0.04cm}{1.299cm}}
\pgfpathcurveto{\pgfqpoint{0.066cm}{1.273cm}}{\pgfqpoint{0.1cm}{1.258cm}}{\pgfqpoint{0.137cm}{1.258cm}}
\pgfpathcurveto{\pgfqpoint{0.173cm}{1.258cm}}{\pgfqpoint{0.207cm}{1.273cm}}{\pgfqpoint{0.233cm}{1.299cm}}
\pgfpathcurveto{\pgfqpoint{0.259cm}{1.324cm}}{\pgfqpoint{0.273cm}{1.359cm}}{\pgfqpoint{0.273cm}{1.395cm}}
\pgfusepath{fill}
\begin{pgfscope}
\pgfsetdash{}{0cm}
\pgfsetlinewidth{0.818mm}
\pgfsetmiterlimit{7.0}
\pgfpathmoveto{\pgfqpoint{0.682cm}{0.671cm}}
\pgfpathlineto{\pgfqpoint{0.679cm}{1.418cm}}
\pgfusepath{stroke}
\end{pgfscope}
\pgfpathmoveto{\pgfqpoint{0.815cm}{1.399cm}}
\pgfpathcurveto{\pgfqpoint{0.815cm}{1.435cm}}{\pgfqpoint{0.801cm}{1.47cm}}{\pgfqpoint{0.775cm}{1.496cm}}
\pgfpathcurveto{\pgfqpoint{0.75cm}{1.521cm}}{\pgfqpoint{0.715cm}{1.536cm}}{\pgfqpoint{0.679cm}{1.536cm}}
\pgfpathcurveto{\pgfqpoint{0.643cm}{1.536cm}}{\pgfqpoint{0.608cm}{1.521cm}}{\pgfqpoint{0.582cm}{1.496cm}}
\pgfpathcurveto{\pgfqpoint{0.557cm}{1.47cm}}{\pgfqpoint{0.542cm}{1.435cm}}{\pgfqpoint{0.542cm}{1.399cm}}
\pgfpathcurveto{\pgfqpoint{0.542cm}{1.363cm}}{\pgfqpoint{0.557cm}{1.328cm}}{\pgfqpoint{0.582cm}{1.302cm}}
\pgfpathcurveto{\pgfqpoint{0.608cm}{1.276cm}}{\pgfqpoint{0.643cm}{1.262cm}}{\pgfqpoint{0.679cm}{1.262cm}}
\pgfpathcurveto{\pgfqpoint{0.715cm}{1.262cm}}{\pgfqpoint{0.75cm}{1.276cm}}{\pgfqpoint{0.775cm}{1.302cm}}
\pgfpathcurveto{\pgfqpoint{0.801cm}{1.328cm}}{\pgfqpoint{0.815cm}{1.363cm}}{\pgfqpoint{0.815cm}{1.399cm}}
\pgfusepath{fill}
\pgfpathmoveto{\pgfqpoint{1.345cm}{1.371cm}}
\pgfpathcurveto{\pgfqpoint{1.345cm}{1.408cm}}{\pgfqpoint{1.331cm}{1.442cm}}{\pgfqpoint{1.305cm}{1.468cm}}
\pgfpathcurveto{\pgfqpoint{1.28cm}{1.494cm}}{\pgfqpoint{1.245cm}{1.508cm}}{\pgfqpoint{1.209cm}{1.508cm}}
\pgfpathcurveto{\pgfqpoint{1.172cm}{1.508cm}}{\pgfqpoint{1.138cm}{1.494cm}}{\pgfqpoint{1.112cm}{1.468cm}}
\pgfpathcurveto{\pgfqpoint{1.087cm}{1.442cm}}{\pgfqpoint{1.072cm}{1.408cm}}{\pgfqpoint{1.072cm}{1.371cm}}
\pgfpathcurveto{\pgfqpoint{1.072cm}{1.335cm}}{\pgfqpoint{1.087cm}{1.3cm}}{\pgfqpoint{1.112cm}{1.274cm}}
\pgfpathcurveto{\pgfqpoint{1.138cm}{1.249cm}}{\pgfqpoint{1.172cm}{1.234cm}}{\pgfqpoint{1.209cm}{1.234cm}}
\pgfpathcurveto{\pgfqpoint{1.245cm}{1.234cm}}{\pgfqpoint{1.28cm}{1.249cm}}{\pgfqpoint{1.305cm}{1.274cm}}
\pgfpathcurveto{\pgfqpoint{1.331cm}{1.3cm}}{\pgfqpoint{1.345cm}{1.335cm}}{\pgfqpoint{1.345cm}{1.371cm}}
\pgfusepath{fill}
\begin{pgfscope}
\pgfsetdash{}{0cm}
\pgfsetlinewidth{0.818mm}
\pgfsetroundcap
\pgfsetmiterlimit{4.0}
\pgfpathmoveto{\pgfqpoint{0.682cm}{0.671cm}}
\pgfpathlineto{\pgfqpoint{0.682cm}{0.042cm}}
\pgfusepath{stroke}
\end{pgfscope}
\end{pgfscope}
\end{pgfscope}
\end{pgfscope}
\end{tikzpicture}}}+\phi+\psi)\Prec\llbracket X^{2}\rrbracket- (-X^{\!\resizebox{0.6em}{!}{
\begin{tikzpicture}
\pgfpathmoveto{\pgfqpoint{0cm}{-0.035cm}}
\pgfpathlineto{\pgfqpoint{1.376cm}{-0.035cm}}
\pgfpathlineto{\pgfqpoint{1.376cm}{1.552cm}}
\pgfpathlineto{\pgfqpoint{0cm}{1.552cm}}
\pgfpathclose
\pgfusepath{clip}
\begin{pgfscope}
\begin{pgfscope}
\pgfpathmoveto{\pgfqpoint{0cm}{-0.035cm}}
\pgfpathlineto{\pgfqpoint{1.376cm}{-0.035cm}}
\pgfpathlineto{\pgfqpoint{1.376cm}{1.552cm}}
\pgfpathlineto{\pgfqpoint{0cm}{1.552cm}}
\pgfpathclose
\pgfusepath{clip}
\begin{pgfscope}
\begin{pgfscope}
\pgfsetdash{}{0cm}
\pgfsetlinewidth{0.818mm}
\pgfsetroundcap
\pgfsetroundjoin
\pgfsetmiterlimit{7.0}
\definecolor{eps2pgf_color}{gray}{0}\pgfsetstrokecolor{eps2pgf_color}\pgfsetfillcolor{eps2pgf_color}
\pgfpathmoveto{\pgfqpoint{0.117cm}{1.421cm}}
\pgfpathlineto{\pgfqpoint{0.682cm}{0.671cm}}
\pgfpathlineto{\pgfqpoint{1.246cm}{1.421cm}}
\pgfusepath{stroke}
\end{pgfscope}
\definecolor{eps2pgf_color}{gray}{0}\pgfsetstrokecolor{eps2pgf_color}\pgfsetfillcolor{eps2pgf_color}
\pgfpathmoveto{\pgfqpoint{0.273cm}{1.395cm}}
\pgfpathcurveto{\pgfqpoint{0.273cm}{1.432cm}}{\pgfqpoint{0.259cm}{1.467cm}}{\pgfqpoint{0.233cm}{1.492cm}}
\pgfpathcurveto{\pgfqpoint{0.207cm}{1.518cm}}{\pgfqpoint{0.173cm}{1.532cm}}{\pgfqpoint{0.137cm}{1.532cm}}
\pgfpathcurveto{\pgfqpoint{0.1cm}{1.532cm}}{\pgfqpoint{0.066cm}{1.518cm}}{\pgfqpoint{0.04cm}{1.492cm}}
\pgfpathcurveto{\pgfqpoint{0.014cm}{1.467cm}}{\pgfqpoint{0cm}{1.432cm}}{\pgfqpoint{0cm}{1.395cm}}
\pgfpathcurveto{\pgfqpoint{0cm}{1.359cm}}{\pgfqpoint{0.014cm}{1.324cm}}{\pgfqpoint{0.04cm}{1.299cm}}
\pgfpathcurveto{\pgfqpoint{0.066cm}{1.273cm}}{\pgfqpoint{0.1cm}{1.258cm}}{\pgfqpoint{0.137cm}{1.258cm}}
\pgfpathcurveto{\pgfqpoint{0.173cm}{1.258cm}}{\pgfqpoint{0.207cm}{1.273cm}}{\pgfqpoint{0.233cm}{1.299cm}}
\pgfpathcurveto{\pgfqpoint{0.259cm}{1.324cm}}{\pgfqpoint{0.273cm}{1.359cm}}{\pgfqpoint{0.273cm}{1.395cm}}
\pgfusepath{fill}
\begin{pgfscope}
\pgfsetdash{}{0cm}
\pgfsetlinewidth{0.818mm}
\pgfsetmiterlimit{7.0}
\pgfpathmoveto{\pgfqpoint{0.682cm}{0.671cm}}
\pgfpathlineto{\pgfqpoint{0.679cm}{1.418cm}}
\pgfusepath{stroke}
\end{pgfscope}
\pgfpathmoveto{\pgfqpoint{0.815cm}{1.399cm}}
\pgfpathcurveto{\pgfqpoint{0.815cm}{1.435cm}}{\pgfqpoint{0.801cm}{1.47cm}}{\pgfqpoint{0.775cm}{1.496cm}}
\pgfpathcurveto{\pgfqpoint{0.75cm}{1.521cm}}{\pgfqpoint{0.715cm}{1.536cm}}{\pgfqpoint{0.679cm}{1.536cm}}
\pgfpathcurveto{\pgfqpoint{0.643cm}{1.536cm}}{\pgfqpoint{0.608cm}{1.521cm}}{\pgfqpoint{0.582cm}{1.496cm}}
\pgfpathcurveto{\pgfqpoint{0.557cm}{1.47cm}}{\pgfqpoint{0.542cm}{1.435cm}}{\pgfqpoint{0.542cm}{1.399cm}}
\pgfpathcurveto{\pgfqpoint{0.542cm}{1.363cm}}{\pgfqpoint{0.557cm}{1.328cm}}{\pgfqpoint{0.582cm}{1.302cm}}
\pgfpathcurveto{\pgfqpoint{0.608cm}{1.276cm}}{\pgfqpoint{0.643cm}{1.262cm}}{\pgfqpoint{0.679cm}{1.262cm}}
\pgfpathcurveto{\pgfqpoint{0.715cm}{1.262cm}}{\pgfqpoint{0.75cm}{1.276cm}}{\pgfqpoint{0.775cm}{1.302cm}}
\pgfpathcurveto{\pgfqpoint{0.801cm}{1.328cm}}{\pgfqpoint{0.815cm}{1.363cm}}{\pgfqpoint{0.815cm}{1.399cm}}
\pgfusepath{fill}
\pgfpathmoveto{\pgfqpoint{1.345cm}{1.371cm}}
\pgfpathcurveto{\pgfqpoint{1.345cm}{1.408cm}}{\pgfqpoint{1.331cm}{1.442cm}}{\pgfqpoint{1.305cm}{1.468cm}}
\pgfpathcurveto{\pgfqpoint{1.28cm}{1.494cm}}{\pgfqpoint{1.245cm}{1.508cm}}{\pgfqpoint{1.209cm}{1.508cm}}
\pgfpathcurveto{\pgfqpoint{1.172cm}{1.508cm}}{\pgfqpoint{1.138cm}{1.494cm}}{\pgfqpoint{1.112cm}{1.468cm}}
\pgfpathcurveto{\pgfqpoint{1.087cm}{1.442cm}}{\pgfqpoint{1.072cm}{1.408cm}}{\pgfqpoint{1.072cm}{1.371cm}}
\pgfpathcurveto{\pgfqpoint{1.072cm}{1.335cm}}{\pgfqpoint{1.087cm}{1.3cm}}{\pgfqpoint{1.112cm}{1.274cm}}
\pgfpathcurveto{\pgfqpoint{1.138cm}{1.249cm}}{\pgfqpoint{1.172cm}{1.234cm}}{\pgfqpoint{1.209cm}{1.234cm}}
\pgfpathcurveto{\pgfqpoint{1.245cm}{1.234cm}}{\pgfqpoint{1.28cm}{1.249cm}}{\pgfqpoint{1.305cm}{1.274cm}}
\pgfpathcurveto{\pgfqpoint{1.331cm}{1.3cm}}{\pgfqpoint{1.345cm}{1.335cm}}{\pgfqpoint{1.345cm}{1.371cm}}
\pgfusepath{fill}
\begin{pgfscope}
\pgfsetdash{}{0cm}
\pgfsetlinewidth{0.818mm}
\pgfsetroundcap
\pgfsetmiterlimit{4.0}
\pgfpathmoveto{\pgfqpoint{0.682cm}{0.671cm}}
\pgfpathlineto{\pgfqpoint{0.682cm}{0.042cm}}
\pgfusepath{stroke}
\end{pgfscope}
\end{pgfscope}
\end{pgfscope}
\end{pgfscope}
\end{tikzpicture}}}+\phi+\psi)\prec\llbracket X^{2}\rrbracket\Big)-3[\LL,(-X^{\!\resizebox{0.6em}{!}{
\begin{tikzpicture}
\pgfpathmoveto{\pgfqpoint{0cm}{-0.035cm}}
\pgfpathlineto{\pgfqpoint{1.376cm}{-0.035cm}}
\pgfpathlineto{\pgfqpoint{1.376cm}{1.552cm}}
\pgfpathlineto{\pgfqpoint{0cm}{1.552cm}}
\pgfpathclose
\pgfusepath{clip}
\begin{pgfscope}
\begin{pgfscope}
\pgfpathmoveto{\pgfqpoint{0cm}{-0.035cm}}
\pgfpathlineto{\pgfqpoint{1.376cm}{-0.035cm}}
\pgfpathlineto{\pgfqpoint{1.376cm}{1.552cm}}
\pgfpathlineto{\pgfqpoint{0cm}{1.552cm}}
\pgfpathclose
\pgfusepath{clip}
\begin{pgfscope}
\begin{pgfscope}
\pgfsetdash{}{0cm}
\pgfsetlinewidth{0.818mm}
\pgfsetroundcap
\pgfsetroundjoin
\pgfsetmiterlimit{7.0}
\definecolor{eps2pgf_color}{gray}{0}\pgfsetstrokecolor{eps2pgf_color}\pgfsetfillcolor{eps2pgf_color}
\pgfpathmoveto{\pgfqpoint{0.117cm}{1.421cm}}
\pgfpathlineto{\pgfqpoint{0.682cm}{0.671cm}}
\pgfpathlineto{\pgfqpoint{1.246cm}{1.421cm}}
\pgfusepath{stroke}
\end{pgfscope}
\definecolor{eps2pgf_color}{gray}{0}\pgfsetstrokecolor{eps2pgf_color}\pgfsetfillcolor{eps2pgf_color}
\pgfpathmoveto{\pgfqpoint{0.273cm}{1.395cm}}
\pgfpathcurveto{\pgfqpoint{0.273cm}{1.432cm}}{\pgfqpoint{0.259cm}{1.467cm}}{\pgfqpoint{0.233cm}{1.492cm}}
\pgfpathcurveto{\pgfqpoint{0.207cm}{1.518cm}}{\pgfqpoint{0.173cm}{1.532cm}}{\pgfqpoint{0.137cm}{1.532cm}}
\pgfpathcurveto{\pgfqpoint{0.1cm}{1.532cm}}{\pgfqpoint{0.066cm}{1.518cm}}{\pgfqpoint{0.04cm}{1.492cm}}
\pgfpathcurveto{\pgfqpoint{0.014cm}{1.467cm}}{\pgfqpoint{0cm}{1.432cm}}{\pgfqpoint{0cm}{1.395cm}}
\pgfpathcurveto{\pgfqpoint{0cm}{1.359cm}}{\pgfqpoint{0.014cm}{1.324cm}}{\pgfqpoint{0.04cm}{1.299cm}}
\pgfpathcurveto{\pgfqpoint{0.066cm}{1.273cm}}{\pgfqpoint{0.1cm}{1.258cm}}{\pgfqpoint{0.137cm}{1.258cm}}
\pgfpathcurveto{\pgfqpoint{0.173cm}{1.258cm}}{\pgfqpoint{0.207cm}{1.273cm}}{\pgfqpoint{0.233cm}{1.299cm}}
\pgfpathcurveto{\pgfqpoint{0.259cm}{1.324cm}}{\pgfqpoint{0.273cm}{1.359cm}}{\pgfqpoint{0.273cm}{1.395cm}}
\pgfusepath{fill}
\begin{pgfscope}
\pgfsetdash{}{0cm}
\pgfsetlinewidth{0.818mm}
\pgfsetmiterlimit{7.0}
\pgfpathmoveto{\pgfqpoint{0.682cm}{0.671cm}}
\pgfpathlineto{\pgfqpoint{0.679cm}{1.418cm}}
\pgfusepath{stroke}
\end{pgfscope}
\pgfpathmoveto{\pgfqpoint{0.815cm}{1.399cm}}
\pgfpathcurveto{\pgfqpoint{0.815cm}{1.435cm}}{\pgfqpoint{0.801cm}{1.47cm}}{\pgfqpoint{0.775cm}{1.496cm}}
\pgfpathcurveto{\pgfqpoint{0.75cm}{1.521cm}}{\pgfqpoint{0.715cm}{1.536cm}}{\pgfqpoint{0.679cm}{1.536cm}}
\pgfpathcurveto{\pgfqpoint{0.643cm}{1.536cm}}{\pgfqpoint{0.608cm}{1.521cm}}{\pgfqpoint{0.582cm}{1.496cm}}
\pgfpathcurveto{\pgfqpoint{0.557cm}{1.47cm}}{\pgfqpoint{0.542cm}{1.435cm}}{\pgfqpoint{0.542cm}{1.399cm}}
\pgfpathcurveto{\pgfqpoint{0.542cm}{1.363cm}}{\pgfqpoint{0.557cm}{1.328cm}}{\pgfqpoint{0.582cm}{1.302cm}}
\pgfpathcurveto{\pgfqpoint{0.608cm}{1.276cm}}{\pgfqpoint{0.643cm}{1.262cm}}{\pgfqpoint{0.679cm}{1.262cm}}
\pgfpathcurveto{\pgfqpoint{0.715cm}{1.262cm}}{\pgfqpoint{0.75cm}{1.276cm}}{\pgfqpoint{0.775cm}{1.302cm}}
\pgfpathcurveto{\pgfqpoint{0.801cm}{1.328cm}}{\pgfqpoint{0.815cm}{1.363cm}}{\pgfqpoint{0.815cm}{1.399cm}}
\pgfusepath{fill}
\pgfpathmoveto{\pgfqpoint{1.345cm}{1.371cm}}
\pgfpathcurveto{\pgfqpoint{1.345cm}{1.408cm}}{\pgfqpoint{1.331cm}{1.442cm}}{\pgfqpoint{1.305cm}{1.468cm}}
\pgfpathcurveto{\pgfqpoint{1.28cm}{1.494cm}}{\pgfqpoint{1.245cm}{1.508cm}}{\pgfqpoint{1.209cm}{1.508cm}}
\pgfpathcurveto{\pgfqpoint{1.172cm}{1.508cm}}{\pgfqpoint{1.138cm}{1.494cm}}{\pgfqpoint{1.112cm}{1.468cm}}
\pgfpathcurveto{\pgfqpoint{1.087cm}{1.442cm}}{\pgfqpoint{1.072cm}{1.408cm}}{\pgfqpoint{1.072cm}{1.371cm}}
\pgfpathcurveto{\pgfqpoint{1.072cm}{1.335cm}}{\pgfqpoint{1.087cm}{1.3cm}}{\pgfqpoint{1.112cm}{1.274cm}}
\pgfpathcurveto{\pgfqpoint{1.138cm}{1.249cm}}{\pgfqpoint{1.172cm}{1.234cm}}{\pgfqpoint{1.209cm}{1.234cm}}
\pgfpathcurveto{\pgfqpoint{1.245cm}{1.234cm}}{\pgfqpoint{1.28cm}{1.249cm}}{\pgfqpoint{1.305cm}{1.274cm}}
\pgfpathcurveto{\pgfqpoint{1.331cm}{1.3cm}}{\pgfqpoint{1.345cm}{1.335cm}}{\pgfqpoint{1.345cm}{1.371cm}}
\pgfusepath{fill}
\begin{pgfscope}
\pgfsetdash{}{0cm}
\pgfsetlinewidth{0.818mm}
\pgfsetroundcap
\pgfsetmiterlimit{4.0}
\pgfpathmoveto{\pgfqpoint{0.682cm}{0.671cm}}
\pgfpathlineto{\pgfqpoint{0.682cm}{0.042cm}}
\pgfusepath{stroke}
\end{pgfscope}
\end{pgfscope}
\end{pgfscope}
\end{pgfscope}
\end{tikzpicture}}}+\phi+\psi)\Prec]X^{\!\resizebox{0.6em}{!}{
\begin{tikzpicture}
\pgfpathmoveto{\pgfqpoint{0cm}{0cm}}
\pgfpathlineto{\pgfqpoint{1.376cm}{0cm}}
\pgfpathlineto{\pgfqpoint{1.376cm}{1.588cm}}
\pgfpathlineto{\pgfqpoint{0cm}{1.588cm}}
\pgfpathclose
\pgfusepath{clip}
\begin{pgfscope}
\begin{pgfscope}
\pgfpathmoveto{\pgfqpoint{0cm}{0cm}}
\pgfpathlineto{\pgfqpoint{1.376cm}{0cm}}
\pgfpathlineto{\pgfqpoint{1.376cm}{1.588cm}}
\pgfpathlineto{\pgfqpoint{0cm}{1.588cm}}
\pgfpathclose
\pgfusepath{clip}
\begin{pgfscope}
\begin{pgfscope}
\definecolor{eps2pgf_color}{gray}{0.976471}\pgfsetstrokecolor{eps2pgf_color}\pgfsetfillcolor{eps2pgf_color}
\pgfpathmoveto{\pgfqpoint{0cm}{0cm}}
\pgfpathlineto{\pgfqpoint{1.376cm}{0cm}}
\pgfpathlineto{\pgfqpoint{1.376cm}{1.588cm}}
\pgfpathlineto{\pgfqpoint{0cm}{1.588cm}}
\pgfpathclose
\pgfusepath{fill}
\end{pgfscope}
\begin{pgfscope}
\pgfsetdash{}{0cm}
\pgfsetlinewidth{0.818mm}
\pgfsetroundcap
\pgfsetroundjoin
\pgfsetmiterlimit{7.0}
\definecolor{eps2pgf_color}{gray}{0}\pgfsetstrokecolor{eps2pgf_color}\pgfsetfillcolor{eps2pgf_color}
\pgfpathmoveto{\pgfqpoint{0.117cm}{1.476cm}}
\pgfpathlineto{\pgfqpoint{0.682cm}{0.726cm}}
\pgfpathlineto{\pgfqpoint{1.246cm}{1.476cm}}
\pgfusepath{stroke}
\end{pgfscope}
\definecolor{eps2pgf_color}{gray}{0}\pgfsetstrokecolor{eps2pgf_color}\pgfsetfillcolor{eps2pgf_color}
\pgfpathmoveto{\pgfqpoint{0.273cm}{1.451cm}}
\pgfpathcurveto{\pgfqpoint{0.273cm}{1.487cm}}{\pgfqpoint{0.259cm}{1.522cm}}{\pgfqpoint{0.233cm}{1.547cm}}
\pgfpathcurveto{\pgfqpoint{0.207cm}{1.573cm}}{\pgfqpoint{0.173cm}{1.588cm}}{\pgfqpoint{0.137cm}{1.588cm}}
\pgfpathcurveto{\pgfqpoint{0.1cm}{1.588cm}}{\pgfqpoint{0.066cm}{1.573cm}}{\pgfqpoint{0.04cm}{1.547cm}}
\pgfpathcurveto{\pgfqpoint{0.014cm}{1.522cm}}{\pgfqpoint{0cm}{1.487cm}}{\pgfqpoint{0cm}{1.451cm}}
\pgfpathcurveto{\pgfqpoint{0cm}{1.414cm}}{\pgfqpoint{0.014cm}{1.379cm}}{\pgfqpoint{0.04cm}{1.354cm}}
\pgfpathcurveto{\pgfqpoint{0.066cm}{1.328cm}}{\pgfqpoint{0.1cm}{1.314cm}}{\pgfqpoint{0.137cm}{1.314cm}}
\pgfpathcurveto{\pgfqpoint{0.173cm}{1.314cm}}{\pgfqpoint{0.207cm}{1.328cm}}{\pgfqpoint{0.233cm}{1.354cm}}
\pgfpathcurveto{\pgfqpoint{0.259cm}{1.379cm}}{\pgfqpoint{0.273cm}{1.414cm}}{\pgfqpoint{0.273cm}{1.451cm}}
\pgfusepath{fill}
\pgfpathmoveto{\pgfqpoint{1.345cm}{1.426cm}}
\pgfpathcurveto{\pgfqpoint{1.345cm}{1.463cm}}{\pgfqpoint{1.331cm}{1.497cm}}{\pgfqpoint{1.305cm}{1.523cm}}
\pgfpathcurveto{\pgfqpoint{1.28cm}{1.549cm}}{\pgfqpoint{1.245cm}{1.563cm}}{\pgfqpoint{1.209cm}{1.563cm}}
\pgfpathcurveto{\pgfqpoint{1.172cm}{1.563cm}}{\pgfqpoint{1.138cm}{1.549cm}}{\pgfqpoint{1.112cm}{1.523cm}}
\pgfpathcurveto{\pgfqpoint{1.087cm}{1.497cm}}{\pgfqpoint{1.072cm}{1.463cm}}{\pgfqpoint{1.072cm}{1.426cm}}
\pgfpathcurveto{\pgfqpoint{1.072cm}{1.39cm}}{\pgfqpoint{1.087cm}{1.355cm}}{\pgfqpoint{1.112cm}{1.329cm}}
\pgfpathcurveto{\pgfqpoint{1.138cm}{1.304cm}}{\pgfqpoint{1.172cm}{1.289cm}}{\pgfqpoint{1.209cm}{1.289cm}}
\pgfpathcurveto{\pgfqpoint{1.245cm}{1.289cm}}{\pgfqpoint{1.28cm}{1.304cm}}{\pgfqpoint{1.305cm}{1.329cm}}
\pgfpathcurveto{\pgfqpoint{1.331cm}{1.355cm}}{\pgfqpoint{1.345cm}{1.39cm}}{\pgfqpoint{1.345cm}{1.426cm}}
\pgfusepath{fill}
\begin{pgfscope}
\pgfsetdash{}{0cm}
\pgfsetlinewidth{0.818mm}
\pgfsetroundcap
\pgfsetmiterlimit{4.0}
\pgfpathmoveto{\pgfqpoint{0.682cm}{0.726cm}}
\pgfpathlineto{\pgfqpoint{0.682cm}{0.097cm}}
\pgfusepath{stroke}
\end{pgfscope}
\end{pgfscope}
\end{pgfscope}
\end{pgfscope}
\end{tikzpicture}}}.
\end{align*}
Here the two new terms are estimated using Lemma \ref{lem:5.1} as follows
\begin{align*}
&3\big\| (-X^{\!\resizebox{0.6em}{!}{
\begin{tikzpicture}
\pgfpathmoveto{\pgfqpoint{0cm}{-0.035cm}}
\pgfpathlineto{\pgfqpoint{1.376cm}{-0.035cm}}
\pgfpathlineto{\pgfqpoint{1.376cm}{1.552cm}}
\pgfpathlineto{\pgfqpoint{0cm}{1.552cm}}
\pgfpathclose
\pgfusepath{clip}
\begin{pgfscope}
\begin{pgfscope}
\pgfpathmoveto{\pgfqpoint{0cm}{-0.035cm}}
\pgfpathlineto{\pgfqpoint{1.376cm}{-0.035cm}}
\pgfpathlineto{\pgfqpoint{1.376cm}{1.552cm}}
\pgfpathlineto{\pgfqpoint{0cm}{1.552cm}}
\pgfpathclose
\pgfusepath{clip}
\begin{pgfscope}
\begin{pgfscope}
\pgfsetdash{}{0cm}
\pgfsetlinewidth{0.818mm}
\pgfsetroundcap
\pgfsetroundjoin
\pgfsetmiterlimit{7.0}
\definecolor{eps2pgf_color}{gray}{0}\pgfsetstrokecolor{eps2pgf_color}\pgfsetfillcolor{eps2pgf_color}
\pgfpathmoveto{\pgfqpoint{0.117cm}{1.421cm}}
\pgfpathlineto{\pgfqpoint{0.682cm}{0.671cm}}
\pgfpathlineto{\pgfqpoint{1.246cm}{1.421cm}}
\pgfusepath{stroke}
\end{pgfscope}
\definecolor{eps2pgf_color}{gray}{0}\pgfsetstrokecolor{eps2pgf_color}\pgfsetfillcolor{eps2pgf_color}
\pgfpathmoveto{\pgfqpoint{0.273cm}{1.395cm}}
\pgfpathcurveto{\pgfqpoint{0.273cm}{1.432cm}}{\pgfqpoint{0.259cm}{1.467cm}}{\pgfqpoint{0.233cm}{1.492cm}}
\pgfpathcurveto{\pgfqpoint{0.207cm}{1.518cm}}{\pgfqpoint{0.173cm}{1.532cm}}{\pgfqpoint{0.137cm}{1.532cm}}
\pgfpathcurveto{\pgfqpoint{0.1cm}{1.532cm}}{\pgfqpoint{0.066cm}{1.518cm}}{\pgfqpoint{0.04cm}{1.492cm}}
\pgfpathcurveto{\pgfqpoint{0.014cm}{1.467cm}}{\pgfqpoint{0cm}{1.432cm}}{\pgfqpoint{0cm}{1.395cm}}
\pgfpathcurveto{\pgfqpoint{0cm}{1.359cm}}{\pgfqpoint{0.014cm}{1.324cm}}{\pgfqpoint{0.04cm}{1.299cm}}
\pgfpathcurveto{\pgfqpoint{0.066cm}{1.273cm}}{\pgfqpoint{0.1cm}{1.258cm}}{\pgfqpoint{0.137cm}{1.258cm}}
\pgfpathcurveto{\pgfqpoint{0.173cm}{1.258cm}}{\pgfqpoint{0.207cm}{1.273cm}}{\pgfqpoint{0.233cm}{1.299cm}}
\pgfpathcurveto{\pgfqpoint{0.259cm}{1.324cm}}{\pgfqpoint{0.273cm}{1.359cm}}{\pgfqpoint{0.273cm}{1.395cm}}
\pgfusepath{fill}
\begin{pgfscope}
\pgfsetdash{}{0cm}
\pgfsetlinewidth{0.818mm}
\pgfsetmiterlimit{7.0}
\pgfpathmoveto{\pgfqpoint{0.682cm}{0.671cm}}
\pgfpathlineto{\pgfqpoint{0.679cm}{1.418cm}}
\pgfusepath{stroke}
\end{pgfscope}
\pgfpathmoveto{\pgfqpoint{0.815cm}{1.399cm}}
\pgfpathcurveto{\pgfqpoint{0.815cm}{1.435cm}}{\pgfqpoint{0.801cm}{1.47cm}}{\pgfqpoint{0.775cm}{1.496cm}}
\pgfpathcurveto{\pgfqpoint{0.75cm}{1.521cm}}{\pgfqpoint{0.715cm}{1.536cm}}{\pgfqpoint{0.679cm}{1.536cm}}
\pgfpathcurveto{\pgfqpoint{0.643cm}{1.536cm}}{\pgfqpoint{0.608cm}{1.521cm}}{\pgfqpoint{0.582cm}{1.496cm}}
\pgfpathcurveto{\pgfqpoint{0.557cm}{1.47cm}}{\pgfqpoint{0.542cm}{1.435cm}}{\pgfqpoint{0.542cm}{1.399cm}}
\pgfpathcurveto{\pgfqpoint{0.542cm}{1.363cm}}{\pgfqpoint{0.557cm}{1.328cm}}{\pgfqpoint{0.582cm}{1.302cm}}
\pgfpathcurveto{\pgfqpoint{0.608cm}{1.276cm}}{\pgfqpoint{0.643cm}{1.262cm}}{\pgfqpoint{0.679cm}{1.262cm}}
\pgfpathcurveto{\pgfqpoint{0.715cm}{1.262cm}}{\pgfqpoint{0.75cm}{1.276cm}}{\pgfqpoint{0.775cm}{1.302cm}}
\pgfpathcurveto{\pgfqpoint{0.801cm}{1.328cm}}{\pgfqpoint{0.815cm}{1.363cm}}{\pgfqpoint{0.815cm}{1.399cm}}
\pgfusepath{fill}
\pgfpathmoveto{\pgfqpoint{1.345cm}{1.371cm}}
\pgfpathcurveto{\pgfqpoint{1.345cm}{1.408cm}}{\pgfqpoint{1.331cm}{1.442cm}}{\pgfqpoint{1.305cm}{1.468cm}}
\pgfpathcurveto{\pgfqpoint{1.28cm}{1.494cm}}{\pgfqpoint{1.245cm}{1.508cm}}{\pgfqpoint{1.209cm}{1.508cm}}
\pgfpathcurveto{\pgfqpoint{1.172cm}{1.508cm}}{\pgfqpoint{1.138cm}{1.494cm}}{\pgfqpoint{1.112cm}{1.468cm}}
\pgfpathcurveto{\pgfqpoint{1.087cm}{1.442cm}}{\pgfqpoint{1.072cm}{1.408cm}}{\pgfqpoint{1.072cm}{1.371cm}}
\pgfpathcurveto{\pgfqpoint{1.072cm}{1.335cm}}{\pgfqpoint{1.087cm}{1.3cm}}{\pgfqpoint{1.112cm}{1.274cm}}
\pgfpathcurveto{\pgfqpoint{1.138cm}{1.249cm}}{\pgfqpoint{1.172cm}{1.234cm}}{\pgfqpoint{1.209cm}{1.234cm}}
\pgfpathcurveto{\pgfqpoint{1.245cm}{1.234cm}}{\pgfqpoint{1.28cm}{1.249cm}}{\pgfqpoint{1.305cm}{1.274cm}}
\pgfpathcurveto{\pgfqpoint{1.331cm}{1.3cm}}{\pgfqpoint{1.345cm}{1.335cm}}{\pgfqpoint{1.345cm}{1.371cm}}
\pgfusepath{fill}
\begin{pgfscope}
\pgfsetdash{}{0cm}
\pgfsetlinewidth{0.818mm}
\pgfsetroundcap
\pgfsetmiterlimit{4.0}
\pgfpathmoveto{\pgfqpoint{0.682cm}{0.671cm}}
\pgfpathlineto{\pgfqpoint{0.682cm}{0.042cm}}
\pgfusepath{stroke}
\end{pgfscope}
\end{pgfscope}
\end{pgfscope}
\end{pgfscope}
\end{tikzpicture}}}+\phi+\psi)\Prec\llbracket X^{2}\rrbracket- (-X^{\!\resizebox{0.6em}{!}{
\begin{tikzpicture}
\pgfpathmoveto{\pgfqpoint{0cm}{-0.035cm}}
\pgfpathlineto{\pgfqpoint{1.376cm}{-0.035cm}}
\pgfpathlineto{\pgfqpoint{1.376cm}{1.552cm}}
\pgfpathlineto{\pgfqpoint{0cm}{1.552cm}}
\pgfpathclose
\pgfusepath{clip}
\begin{pgfscope}
\begin{pgfscope}
\pgfpathmoveto{\pgfqpoint{0cm}{-0.035cm}}
\pgfpathlineto{\pgfqpoint{1.376cm}{-0.035cm}}
\pgfpathlineto{\pgfqpoint{1.376cm}{1.552cm}}
\pgfpathlineto{\pgfqpoint{0cm}{1.552cm}}
\pgfpathclose
\pgfusepath{clip}
\begin{pgfscope}
\begin{pgfscope}
\pgfsetdash{}{0cm}
\pgfsetlinewidth{0.818mm}
\pgfsetroundcap
\pgfsetroundjoin
\pgfsetmiterlimit{7.0}
\definecolor{eps2pgf_color}{gray}{0}\pgfsetstrokecolor{eps2pgf_color}\pgfsetfillcolor{eps2pgf_color}
\pgfpathmoveto{\pgfqpoint{0.117cm}{1.421cm}}
\pgfpathlineto{\pgfqpoint{0.682cm}{0.671cm}}
\pgfpathlineto{\pgfqpoint{1.246cm}{1.421cm}}
\pgfusepath{stroke}
\end{pgfscope}
\definecolor{eps2pgf_color}{gray}{0}\pgfsetstrokecolor{eps2pgf_color}\pgfsetfillcolor{eps2pgf_color}
\pgfpathmoveto{\pgfqpoint{0.273cm}{1.395cm}}
\pgfpathcurveto{\pgfqpoint{0.273cm}{1.432cm}}{\pgfqpoint{0.259cm}{1.467cm}}{\pgfqpoint{0.233cm}{1.492cm}}
\pgfpathcurveto{\pgfqpoint{0.207cm}{1.518cm}}{\pgfqpoint{0.173cm}{1.532cm}}{\pgfqpoint{0.137cm}{1.532cm}}
\pgfpathcurveto{\pgfqpoint{0.1cm}{1.532cm}}{\pgfqpoint{0.066cm}{1.518cm}}{\pgfqpoint{0.04cm}{1.492cm}}
\pgfpathcurveto{\pgfqpoint{0.014cm}{1.467cm}}{\pgfqpoint{0cm}{1.432cm}}{\pgfqpoint{0cm}{1.395cm}}
\pgfpathcurveto{\pgfqpoint{0cm}{1.359cm}}{\pgfqpoint{0.014cm}{1.324cm}}{\pgfqpoint{0.04cm}{1.299cm}}
\pgfpathcurveto{\pgfqpoint{0.066cm}{1.273cm}}{\pgfqpoint{0.1cm}{1.258cm}}{\pgfqpoint{0.137cm}{1.258cm}}
\pgfpathcurveto{\pgfqpoint{0.173cm}{1.258cm}}{\pgfqpoint{0.207cm}{1.273cm}}{\pgfqpoint{0.233cm}{1.299cm}}
\pgfpathcurveto{\pgfqpoint{0.259cm}{1.324cm}}{\pgfqpoint{0.273cm}{1.359cm}}{\pgfqpoint{0.273cm}{1.395cm}}
\pgfusepath{fill}
\begin{pgfscope}
\pgfsetdash{}{0cm}
\pgfsetlinewidth{0.818mm}
\pgfsetmiterlimit{7.0}
\pgfpathmoveto{\pgfqpoint{0.682cm}{0.671cm}}
\pgfpathlineto{\pgfqpoint{0.679cm}{1.418cm}}
\pgfusepath{stroke}
\end{pgfscope}
\pgfpathmoveto{\pgfqpoint{0.815cm}{1.399cm}}
\pgfpathcurveto{\pgfqpoint{0.815cm}{1.435cm}}{\pgfqpoint{0.801cm}{1.47cm}}{\pgfqpoint{0.775cm}{1.496cm}}
\pgfpathcurveto{\pgfqpoint{0.75cm}{1.521cm}}{\pgfqpoint{0.715cm}{1.536cm}}{\pgfqpoint{0.679cm}{1.536cm}}
\pgfpathcurveto{\pgfqpoint{0.643cm}{1.536cm}}{\pgfqpoint{0.608cm}{1.521cm}}{\pgfqpoint{0.582cm}{1.496cm}}
\pgfpathcurveto{\pgfqpoint{0.557cm}{1.47cm}}{\pgfqpoint{0.542cm}{1.435cm}}{\pgfqpoint{0.542cm}{1.399cm}}
\pgfpathcurveto{\pgfqpoint{0.542cm}{1.363cm}}{\pgfqpoint{0.557cm}{1.328cm}}{\pgfqpoint{0.582cm}{1.302cm}}
\pgfpathcurveto{\pgfqpoint{0.608cm}{1.276cm}}{\pgfqpoint{0.643cm}{1.262cm}}{\pgfqpoint{0.679cm}{1.262cm}}
\pgfpathcurveto{\pgfqpoint{0.715cm}{1.262cm}}{\pgfqpoint{0.75cm}{1.276cm}}{\pgfqpoint{0.775cm}{1.302cm}}
\pgfpathcurveto{\pgfqpoint{0.801cm}{1.328cm}}{\pgfqpoint{0.815cm}{1.363cm}}{\pgfqpoint{0.815cm}{1.399cm}}
\pgfusepath{fill}
\pgfpathmoveto{\pgfqpoint{1.345cm}{1.371cm}}
\pgfpathcurveto{\pgfqpoint{1.345cm}{1.408cm}}{\pgfqpoint{1.331cm}{1.442cm}}{\pgfqpoint{1.305cm}{1.468cm}}
\pgfpathcurveto{\pgfqpoint{1.28cm}{1.494cm}}{\pgfqpoint{1.245cm}{1.508cm}}{\pgfqpoint{1.209cm}{1.508cm}}
\pgfpathcurveto{\pgfqpoint{1.172cm}{1.508cm}}{\pgfqpoint{1.138cm}{1.494cm}}{\pgfqpoint{1.112cm}{1.468cm}}
\pgfpathcurveto{\pgfqpoint{1.087cm}{1.442cm}}{\pgfqpoint{1.072cm}{1.408cm}}{\pgfqpoint{1.072cm}{1.371cm}}
\pgfpathcurveto{\pgfqpoint{1.072cm}{1.335cm}}{\pgfqpoint{1.087cm}{1.3cm}}{\pgfqpoint{1.112cm}{1.274cm}}
\pgfpathcurveto{\pgfqpoint{1.138cm}{1.249cm}}{\pgfqpoint{1.172cm}{1.234cm}}{\pgfqpoint{1.209cm}{1.234cm}}
\pgfpathcurveto{\pgfqpoint{1.245cm}{1.234cm}}{\pgfqpoint{1.28cm}{1.249cm}}{\pgfqpoint{1.305cm}{1.274cm}}
\pgfpathcurveto{\pgfqpoint{1.331cm}{1.3cm}}{\pgfqpoint{1.345cm}{1.335cm}}{\pgfqpoint{1.345cm}{1.371cm}}
\pgfusepath{fill}
\begin{pgfscope}
\pgfsetdash{}{0cm}
\pgfsetlinewidth{0.818mm}
\pgfsetroundcap
\pgfsetmiterlimit{4.0}
\pgfpathmoveto{\pgfqpoint{0.682cm}{0.671cm}}
\pgfpathlineto{\pgfqpoint{0.682cm}{0.042cm}}
\pgfusepath{stroke}
\end{pgfscope}
\end{pgfscope}
\end{pgfscope}
\end{pgfscope}
\end{tikzpicture}}}+\phi+\psi)\prec\llbracket X^{2}\rrbracket\big\|_{C\CC^{-1+\alpha}(\rho^{3+\gamma'})}\\
& \quad\lesssim \|-X^{\!\resizebox{0.6em}{!}{
\begin{tikzpicture}
\pgfpathmoveto{\pgfqpoint{0cm}{-0.035cm}}
\pgfpathlineto{\pgfqpoint{1.376cm}{-0.035cm}}
\pgfpathlineto{\pgfqpoint{1.376cm}{1.552cm}}
\pgfpathlineto{\pgfqpoint{0cm}{1.552cm}}
\pgfpathclose
\pgfusepath{clip}
\begin{pgfscope}
\begin{pgfscope}
\pgfpathmoveto{\pgfqpoint{0cm}{-0.035cm}}
\pgfpathlineto{\pgfqpoint{1.376cm}{-0.035cm}}
\pgfpathlineto{\pgfqpoint{1.376cm}{1.552cm}}
\pgfpathlineto{\pgfqpoint{0cm}{1.552cm}}
\pgfpathclose
\pgfusepath{clip}
\begin{pgfscope}
\begin{pgfscope}
\pgfsetdash{}{0cm}
\pgfsetlinewidth{0.818mm}
\pgfsetroundcap
\pgfsetroundjoin
\pgfsetmiterlimit{7.0}
\definecolor{eps2pgf_color}{gray}{0}\pgfsetstrokecolor{eps2pgf_color}\pgfsetfillcolor{eps2pgf_color}
\pgfpathmoveto{\pgfqpoint{0.117cm}{1.421cm}}
\pgfpathlineto{\pgfqpoint{0.682cm}{0.671cm}}
\pgfpathlineto{\pgfqpoint{1.246cm}{1.421cm}}
\pgfusepath{stroke}
\end{pgfscope}
\definecolor{eps2pgf_color}{gray}{0}\pgfsetstrokecolor{eps2pgf_color}\pgfsetfillcolor{eps2pgf_color}
\pgfpathmoveto{\pgfqpoint{0.273cm}{1.395cm}}
\pgfpathcurveto{\pgfqpoint{0.273cm}{1.432cm}}{\pgfqpoint{0.259cm}{1.467cm}}{\pgfqpoint{0.233cm}{1.492cm}}
\pgfpathcurveto{\pgfqpoint{0.207cm}{1.518cm}}{\pgfqpoint{0.173cm}{1.532cm}}{\pgfqpoint{0.137cm}{1.532cm}}
\pgfpathcurveto{\pgfqpoint{0.1cm}{1.532cm}}{\pgfqpoint{0.066cm}{1.518cm}}{\pgfqpoint{0.04cm}{1.492cm}}
\pgfpathcurveto{\pgfqpoint{0.014cm}{1.467cm}}{\pgfqpoint{0cm}{1.432cm}}{\pgfqpoint{0cm}{1.395cm}}
\pgfpathcurveto{\pgfqpoint{0cm}{1.359cm}}{\pgfqpoint{0.014cm}{1.324cm}}{\pgfqpoint{0.04cm}{1.299cm}}
\pgfpathcurveto{\pgfqpoint{0.066cm}{1.273cm}}{\pgfqpoint{0.1cm}{1.258cm}}{\pgfqpoint{0.137cm}{1.258cm}}
\pgfpathcurveto{\pgfqpoint{0.173cm}{1.258cm}}{\pgfqpoint{0.207cm}{1.273cm}}{\pgfqpoint{0.233cm}{1.299cm}}
\pgfpathcurveto{\pgfqpoint{0.259cm}{1.324cm}}{\pgfqpoint{0.273cm}{1.359cm}}{\pgfqpoint{0.273cm}{1.395cm}}
\pgfusepath{fill}
\begin{pgfscope}
\pgfsetdash{}{0cm}
\pgfsetlinewidth{0.818mm}
\pgfsetmiterlimit{7.0}
\pgfpathmoveto{\pgfqpoint{0.682cm}{0.671cm}}
\pgfpathlineto{\pgfqpoint{0.679cm}{1.418cm}}
\pgfusepath{stroke}
\end{pgfscope}
\pgfpathmoveto{\pgfqpoint{0.815cm}{1.399cm}}
\pgfpathcurveto{\pgfqpoint{0.815cm}{1.435cm}}{\pgfqpoint{0.801cm}{1.47cm}}{\pgfqpoint{0.775cm}{1.496cm}}
\pgfpathcurveto{\pgfqpoint{0.75cm}{1.521cm}}{\pgfqpoint{0.715cm}{1.536cm}}{\pgfqpoint{0.679cm}{1.536cm}}
\pgfpathcurveto{\pgfqpoint{0.643cm}{1.536cm}}{\pgfqpoint{0.608cm}{1.521cm}}{\pgfqpoint{0.582cm}{1.496cm}}
\pgfpathcurveto{\pgfqpoint{0.557cm}{1.47cm}}{\pgfqpoint{0.542cm}{1.435cm}}{\pgfqpoint{0.542cm}{1.399cm}}
\pgfpathcurveto{\pgfqpoint{0.542cm}{1.363cm}}{\pgfqpoint{0.557cm}{1.328cm}}{\pgfqpoint{0.582cm}{1.302cm}}
\pgfpathcurveto{\pgfqpoint{0.608cm}{1.276cm}}{\pgfqpoint{0.643cm}{1.262cm}}{\pgfqpoint{0.679cm}{1.262cm}}
\pgfpathcurveto{\pgfqpoint{0.715cm}{1.262cm}}{\pgfqpoint{0.75cm}{1.276cm}}{\pgfqpoint{0.775cm}{1.302cm}}
\pgfpathcurveto{\pgfqpoint{0.801cm}{1.328cm}}{\pgfqpoint{0.815cm}{1.363cm}}{\pgfqpoint{0.815cm}{1.399cm}}
\pgfusepath{fill}
\pgfpathmoveto{\pgfqpoint{1.345cm}{1.371cm}}
\pgfpathcurveto{\pgfqpoint{1.345cm}{1.408cm}}{\pgfqpoint{1.331cm}{1.442cm}}{\pgfqpoint{1.305cm}{1.468cm}}
\pgfpathcurveto{\pgfqpoint{1.28cm}{1.494cm}}{\pgfqpoint{1.245cm}{1.508cm}}{\pgfqpoint{1.209cm}{1.508cm}}
\pgfpathcurveto{\pgfqpoint{1.172cm}{1.508cm}}{\pgfqpoint{1.138cm}{1.494cm}}{\pgfqpoint{1.112cm}{1.468cm}}
\pgfpathcurveto{\pgfqpoint{1.087cm}{1.442cm}}{\pgfqpoint{1.072cm}{1.408cm}}{\pgfqpoint{1.072cm}{1.371cm}}
\pgfpathcurveto{\pgfqpoint{1.072cm}{1.335cm}}{\pgfqpoint{1.087cm}{1.3cm}}{\pgfqpoint{1.112cm}{1.274cm}}
\pgfpathcurveto{\pgfqpoint{1.138cm}{1.249cm}}{\pgfqpoint{1.172cm}{1.234cm}}{\pgfqpoint{1.209cm}{1.234cm}}
\pgfpathcurveto{\pgfqpoint{1.245cm}{1.234cm}}{\pgfqpoint{1.28cm}{1.249cm}}{\pgfqpoint{1.305cm}{1.274cm}}
\pgfpathcurveto{\pgfqpoint{1.331cm}{1.3cm}}{\pgfqpoint{1.345cm}{1.335cm}}{\pgfqpoint{1.345cm}{1.371cm}}
\pgfusepath{fill}
\begin{pgfscope}
\pgfsetdash{}{0cm}
\pgfsetlinewidth{0.818mm}
\pgfsetroundcap
\pgfsetmiterlimit{4.0}
\pgfpathmoveto{\pgfqpoint{0.682cm}{0.671cm}}
\pgfpathlineto{\pgfqpoint{0.682cm}{0.042cm}}
\pgfusepath{stroke}
\end{pgfscope}
\end{pgfscope}
\end{pgfscope}
\end{pgfscope}
\end{tikzpicture}}}+\phi+\psi\|_{C^{(\alpha+\kappa)/2}L^{\infty}(\rho^{ 3+\gamma''})} \|\llbracket X^{2} \rrbracket\|_{C\CC^{-1-\kappa}(\rho^{\sigma})}\lesssim 1+\|\phi+\psi\|_{C^{(\alpha+\kappa)/2}L^{\infty}(\rho^{ 3+\gamma''})},
\end{align*}
\begin{align*}
&3\|[\LL,(-X^{\!\resizebox{0.6em}{!}{
\begin{tikzpicture}
\pgfpathmoveto{\pgfqpoint{0cm}{-0.035cm}}
\pgfpathlineto{\pgfqpoint{1.376cm}{-0.035cm}}
\pgfpathlineto{\pgfqpoint{1.376cm}{1.552cm}}
\pgfpathlineto{\pgfqpoint{0cm}{1.552cm}}
\pgfpathclose
\pgfusepath{clip}
\begin{pgfscope}
\begin{pgfscope}
\pgfpathmoveto{\pgfqpoint{0cm}{-0.035cm}}
\pgfpathlineto{\pgfqpoint{1.376cm}{-0.035cm}}
\pgfpathlineto{\pgfqpoint{1.376cm}{1.552cm}}
\pgfpathlineto{\pgfqpoint{0cm}{1.552cm}}
\pgfpathclose
\pgfusepath{clip}
\begin{pgfscope}
\begin{pgfscope}
\pgfsetdash{}{0cm}
\pgfsetlinewidth{0.818mm}
\pgfsetroundcap
\pgfsetroundjoin
\pgfsetmiterlimit{7.0}
\definecolor{eps2pgf_color}{gray}{0}\pgfsetstrokecolor{eps2pgf_color}\pgfsetfillcolor{eps2pgf_color}
\pgfpathmoveto{\pgfqpoint{0.117cm}{1.421cm}}
\pgfpathlineto{\pgfqpoint{0.682cm}{0.671cm}}
\pgfpathlineto{\pgfqpoint{1.246cm}{1.421cm}}
\pgfusepath{stroke}
\end{pgfscope}
\definecolor{eps2pgf_color}{gray}{0}\pgfsetstrokecolor{eps2pgf_color}\pgfsetfillcolor{eps2pgf_color}
\pgfpathmoveto{\pgfqpoint{0.273cm}{1.395cm}}
\pgfpathcurveto{\pgfqpoint{0.273cm}{1.432cm}}{\pgfqpoint{0.259cm}{1.467cm}}{\pgfqpoint{0.233cm}{1.492cm}}
\pgfpathcurveto{\pgfqpoint{0.207cm}{1.518cm}}{\pgfqpoint{0.173cm}{1.532cm}}{\pgfqpoint{0.137cm}{1.532cm}}
\pgfpathcurveto{\pgfqpoint{0.1cm}{1.532cm}}{\pgfqpoint{0.066cm}{1.518cm}}{\pgfqpoint{0.04cm}{1.492cm}}
\pgfpathcurveto{\pgfqpoint{0.014cm}{1.467cm}}{\pgfqpoint{0cm}{1.432cm}}{\pgfqpoint{0cm}{1.395cm}}
\pgfpathcurveto{\pgfqpoint{0cm}{1.359cm}}{\pgfqpoint{0.014cm}{1.324cm}}{\pgfqpoint{0.04cm}{1.299cm}}
\pgfpathcurveto{\pgfqpoint{0.066cm}{1.273cm}}{\pgfqpoint{0.1cm}{1.258cm}}{\pgfqpoint{0.137cm}{1.258cm}}
\pgfpathcurveto{\pgfqpoint{0.173cm}{1.258cm}}{\pgfqpoint{0.207cm}{1.273cm}}{\pgfqpoint{0.233cm}{1.299cm}}
\pgfpathcurveto{\pgfqpoint{0.259cm}{1.324cm}}{\pgfqpoint{0.273cm}{1.359cm}}{\pgfqpoint{0.273cm}{1.395cm}}
\pgfusepath{fill}
\begin{pgfscope}
\pgfsetdash{}{0cm}
\pgfsetlinewidth{0.818mm}
\pgfsetmiterlimit{7.0}
\pgfpathmoveto{\pgfqpoint{0.682cm}{0.671cm}}
\pgfpathlineto{\pgfqpoint{0.679cm}{1.418cm}}
\pgfusepath{stroke}
\end{pgfscope}
\pgfpathmoveto{\pgfqpoint{0.815cm}{1.399cm}}
\pgfpathcurveto{\pgfqpoint{0.815cm}{1.435cm}}{\pgfqpoint{0.801cm}{1.47cm}}{\pgfqpoint{0.775cm}{1.496cm}}
\pgfpathcurveto{\pgfqpoint{0.75cm}{1.521cm}}{\pgfqpoint{0.715cm}{1.536cm}}{\pgfqpoint{0.679cm}{1.536cm}}
\pgfpathcurveto{\pgfqpoint{0.643cm}{1.536cm}}{\pgfqpoint{0.608cm}{1.521cm}}{\pgfqpoint{0.582cm}{1.496cm}}
\pgfpathcurveto{\pgfqpoint{0.557cm}{1.47cm}}{\pgfqpoint{0.542cm}{1.435cm}}{\pgfqpoint{0.542cm}{1.399cm}}
\pgfpathcurveto{\pgfqpoint{0.542cm}{1.363cm}}{\pgfqpoint{0.557cm}{1.328cm}}{\pgfqpoint{0.582cm}{1.302cm}}
\pgfpathcurveto{\pgfqpoint{0.608cm}{1.276cm}}{\pgfqpoint{0.643cm}{1.262cm}}{\pgfqpoint{0.679cm}{1.262cm}}
\pgfpathcurveto{\pgfqpoint{0.715cm}{1.262cm}}{\pgfqpoint{0.75cm}{1.276cm}}{\pgfqpoint{0.775cm}{1.302cm}}
\pgfpathcurveto{\pgfqpoint{0.801cm}{1.328cm}}{\pgfqpoint{0.815cm}{1.363cm}}{\pgfqpoint{0.815cm}{1.399cm}}
\pgfusepath{fill}
\pgfpathmoveto{\pgfqpoint{1.345cm}{1.371cm}}
\pgfpathcurveto{\pgfqpoint{1.345cm}{1.408cm}}{\pgfqpoint{1.331cm}{1.442cm}}{\pgfqpoint{1.305cm}{1.468cm}}
\pgfpathcurveto{\pgfqpoint{1.28cm}{1.494cm}}{\pgfqpoint{1.245cm}{1.508cm}}{\pgfqpoint{1.209cm}{1.508cm}}
\pgfpathcurveto{\pgfqpoint{1.172cm}{1.508cm}}{\pgfqpoint{1.138cm}{1.494cm}}{\pgfqpoint{1.112cm}{1.468cm}}
\pgfpathcurveto{\pgfqpoint{1.087cm}{1.442cm}}{\pgfqpoint{1.072cm}{1.408cm}}{\pgfqpoint{1.072cm}{1.371cm}}
\pgfpathcurveto{\pgfqpoint{1.072cm}{1.335cm}}{\pgfqpoint{1.087cm}{1.3cm}}{\pgfqpoint{1.112cm}{1.274cm}}
\pgfpathcurveto{\pgfqpoint{1.138cm}{1.249cm}}{\pgfqpoint{1.172cm}{1.234cm}}{\pgfqpoint{1.209cm}{1.234cm}}
\pgfpathcurveto{\pgfqpoint{1.245cm}{1.234cm}}{\pgfqpoint{1.28cm}{1.249cm}}{\pgfqpoint{1.305cm}{1.274cm}}
\pgfpathcurveto{\pgfqpoint{1.331cm}{1.3cm}}{\pgfqpoint{1.345cm}{1.335cm}}{\pgfqpoint{1.345cm}{1.371cm}}
\pgfusepath{fill}
\begin{pgfscope}
\pgfsetdash{}{0cm}
\pgfsetlinewidth{0.818mm}
\pgfsetroundcap
\pgfsetmiterlimit{4.0}
\pgfpathmoveto{\pgfqpoint{0.682cm}{0.671cm}}
\pgfpathlineto{\pgfqpoint{0.682cm}{0.042cm}}
\pgfusepath{stroke}
\end{pgfscope}
\end{pgfscope}
\end{pgfscope}
\end{pgfscope}
\end{tikzpicture}}}+\phi+\psi)\Prec]X^{\!\resizebox{0.6em}{!}{
\begin{tikzpicture}
\pgfpathmoveto{\pgfqpoint{0cm}{0cm}}
\pgfpathlineto{\pgfqpoint{1.376cm}{0cm}}
\pgfpathlineto{\pgfqpoint{1.376cm}{1.588cm}}
\pgfpathlineto{\pgfqpoint{0cm}{1.588cm}}
\pgfpathclose
\pgfusepath{clip}
\begin{pgfscope}
\begin{pgfscope}
\pgfpathmoveto{\pgfqpoint{0cm}{0cm}}
\pgfpathlineto{\pgfqpoint{1.376cm}{0cm}}
\pgfpathlineto{\pgfqpoint{1.376cm}{1.588cm}}
\pgfpathlineto{\pgfqpoint{0cm}{1.588cm}}
\pgfpathclose
\pgfusepath{clip}
\begin{pgfscope}
\begin{pgfscope}
\definecolor{eps2pgf_color}{gray}{0.976471}\pgfsetstrokecolor{eps2pgf_color}\pgfsetfillcolor{eps2pgf_color}
\pgfpathmoveto{\pgfqpoint{0cm}{0cm}}
\pgfpathlineto{\pgfqpoint{1.376cm}{0cm}}
\pgfpathlineto{\pgfqpoint{1.376cm}{1.588cm}}
\pgfpathlineto{\pgfqpoint{0cm}{1.588cm}}
\pgfpathclose
\pgfusepath{fill}
\end{pgfscope}
\begin{pgfscope}
\pgfsetdash{}{0cm}
\pgfsetlinewidth{0.818mm}
\pgfsetroundcap
\pgfsetroundjoin
\pgfsetmiterlimit{7.0}
\definecolor{eps2pgf_color}{gray}{0}\pgfsetstrokecolor{eps2pgf_color}\pgfsetfillcolor{eps2pgf_color}
\pgfpathmoveto{\pgfqpoint{0.117cm}{1.476cm}}
\pgfpathlineto{\pgfqpoint{0.682cm}{0.726cm}}
\pgfpathlineto{\pgfqpoint{1.246cm}{1.476cm}}
\pgfusepath{stroke}
\end{pgfscope}
\definecolor{eps2pgf_color}{gray}{0}\pgfsetstrokecolor{eps2pgf_color}\pgfsetfillcolor{eps2pgf_color}
\pgfpathmoveto{\pgfqpoint{0.273cm}{1.451cm}}
\pgfpathcurveto{\pgfqpoint{0.273cm}{1.487cm}}{\pgfqpoint{0.259cm}{1.522cm}}{\pgfqpoint{0.233cm}{1.547cm}}
\pgfpathcurveto{\pgfqpoint{0.207cm}{1.573cm}}{\pgfqpoint{0.173cm}{1.588cm}}{\pgfqpoint{0.137cm}{1.588cm}}
\pgfpathcurveto{\pgfqpoint{0.1cm}{1.588cm}}{\pgfqpoint{0.066cm}{1.573cm}}{\pgfqpoint{0.04cm}{1.547cm}}
\pgfpathcurveto{\pgfqpoint{0.014cm}{1.522cm}}{\pgfqpoint{0cm}{1.487cm}}{\pgfqpoint{0cm}{1.451cm}}
\pgfpathcurveto{\pgfqpoint{0cm}{1.414cm}}{\pgfqpoint{0.014cm}{1.379cm}}{\pgfqpoint{0.04cm}{1.354cm}}
\pgfpathcurveto{\pgfqpoint{0.066cm}{1.328cm}}{\pgfqpoint{0.1cm}{1.314cm}}{\pgfqpoint{0.137cm}{1.314cm}}
\pgfpathcurveto{\pgfqpoint{0.173cm}{1.314cm}}{\pgfqpoint{0.207cm}{1.328cm}}{\pgfqpoint{0.233cm}{1.354cm}}
\pgfpathcurveto{\pgfqpoint{0.259cm}{1.379cm}}{\pgfqpoint{0.273cm}{1.414cm}}{\pgfqpoint{0.273cm}{1.451cm}}
\pgfusepath{fill}
\pgfpathmoveto{\pgfqpoint{1.345cm}{1.426cm}}
\pgfpathcurveto{\pgfqpoint{1.345cm}{1.463cm}}{\pgfqpoint{1.331cm}{1.497cm}}{\pgfqpoint{1.305cm}{1.523cm}}
\pgfpathcurveto{\pgfqpoint{1.28cm}{1.549cm}}{\pgfqpoint{1.245cm}{1.563cm}}{\pgfqpoint{1.209cm}{1.563cm}}
\pgfpathcurveto{\pgfqpoint{1.172cm}{1.563cm}}{\pgfqpoint{1.138cm}{1.549cm}}{\pgfqpoint{1.112cm}{1.523cm}}
\pgfpathcurveto{\pgfqpoint{1.087cm}{1.497cm}}{\pgfqpoint{1.072cm}{1.463cm}}{\pgfqpoint{1.072cm}{1.426cm}}
\pgfpathcurveto{\pgfqpoint{1.072cm}{1.39cm}}{\pgfqpoint{1.087cm}{1.355cm}}{\pgfqpoint{1.112cm}{1.329cm}}
\pgfpathcurveto{\pgfqpoint{1.138cm}{1.304cm}}{\pgfqpoint{1.172cm}{1.289cm}}{\pgfqpoint{1.209cm}{1.289cm}}
\pgfpathcurveto{\pgfqpoint{1.245cm}{1.289cm}}{\pgfqpoint{1.28cm}{1.304cm}}{\pgfqpoint{1.305cm}{1.329cm}}
\pgfpathcurveto{\pgfqpoint{1.331cm}{1.355cm}}{\pgfqpoint{1.345cm}{1.39cm}}{\pgfqpoint{1.345cm}{1.426cm}}
\pgfusepath{fill}
\begin{pgfscope}
\pgfsetdash{}{0cm}
\pgfsetlinewidth{0.818mm}
\pgfsetroundcap
\pgfsetmiterlimit{4.0}
\pgfpathmoveto{\pgfqpoint{0.682cm}{0.726cm}}
\pgfpathlineto{\pgfqpoint{0.682cm}{0.097cm}}
\pgfusepath{stroke}
\end{pgfscope}
\end{pgfscope}
\end{pgfscope}
\end{pgfscope}
\end{tikzpicture}}}\|_{C\CC^{-1+\alpha}(\rho^{3+\gamma'})}\\
&\quad\lesssim \Big( \|-X^{\!\resizebox{0.6em}{!}{
\begin{tikzpicture}
\pgfpathmoveto{\pgfqpoint{0cm}{-0.035cm}}
\pgfpathlineto{\pgfqpoint{1.376cm}{-0.035cm}}
\pgfpathlineto{\pgfqpoint{1.376cm}{1.552cm}}
\pgfpathlineto{\pgfqpoint{0cm}{1.552cm}}
\pgfpathclose
\pgfusepath{clip}
\begin{pgfscope}
\begin{pgfscope}
\pgfpathmoveto{\pgfqpoint{0cm}{-0.035cm}}
\pgfpathlineto{\pgfqpoint{1.376cm}{-0.035cm}}
\pgfpathlineto{\pgfqpoint{1.376cm}{1.552cm}}
\pgfpathlineto{\pgfqpoint{0cm}{1.552cm}}
\pgfpathclose
\pgfusepath{clip}
\begin{pgfscope}
\begin{pgfscope}
\pgfsetdash{}{0cm}
\pgfsetlinewidth{0.818mm}
\pgfsetroundcap
\pgfsetroundjoin
\pgfsetmiterlimit{7.0}
\definecolor{eps2pgf_color}{gray}{0}\pgfsetstrokecolor{eps2pgf_color}\pgfsetfillcolor{eps2pgf_color}
\pgfpathmoveto{\pgfqpoint{0.117cm}{1.421cm}}
\pgfpathlineto{\pgfqpoint{0.682cm}{0.671cm}}
\pgfpathlineto{\pgfqpoint{1.246cm}{1.421cm}}
\pgfusepath{stroke}
\end{pgfscope}
\definecolor{eps2pgf_color}{gray}{0}\pgfsetstrokecolor{eps2pgf_color}\pgfsetfillcolor{eps2pgf_color}
\pgfpathmoveto{\pgfqpoint{0.273cm}{1.395cm}}
\pgfpathcurveto{\pgfqpoint{0.273cm}{1.432cm}}{\pgfqpoint{0.259cm}{1.467cm}}{\pgfqpoint{0.233cm}{1.492cm}}
\pgfpathcurveto{\pgfqpoint{0.207cm}{1.518cm}}{\pgfqpoint{0.173cm}{1.532cm}}{\pgfqpoint{0.137cm}{1.532cm}}
\pgfpathcurveto{\pgfqpoint{0.1cm}{1.532cm}}{\pgfqpoint{0.066cm}{1.518cm}}{\pgfqpoint{0.04cm}{1.492cm}}
\pgfpathcurveto{\pgfqpoint{0.014cm}{1.467cm}}{\pgfqpoint{0cm}{1.432cm}}{\pgfqpoint{0cm}{1.395cm}}
\pgfpathcurveto{\pgfqpoint{0cm}{1.359cm}}{\pgfqpoint{0.014cm}{1.324cm}}{\pgfqpoint{0.04cm}{1.299cm}}
\pgfpathcurveto{\pgfqpoint{0.066cm}{1.273cm}}{\pgfqpoint{0.1cm}{1.258cm}}{\pgfqpoint{0.137cm}{1.258cm}}
\pgfpathcurveto{\pgfqpoint{0.173cm}{1.258cm}}{\pgfqpoint{0.207cm}{1.273cm}}{\pgfqpoint{0.233cm}{1.299cm}}
\pgfpathcurveto{\pgfqpoint{0.259cm}{1.324cm}}{\pgfqpoint{0.273cm}{1.359cm}}{\pgfqpoint{0.273cm}{1.395cm}}
\pgfusepath{fill}
\begin{pgfscope}
\pgfsetdash{}{0cm}
\pgfsetlinewidth{0.818mm}
\pgfsetmiterlimit{7.0}
\pgfpathmoveto{\pgfqpoint{0.682cm}{0.671cm}}
\pgfpathlineto{\pgfqpoint{0.679cm}{1.418cm}}
\pgfusepath{stroke}
\end{pgfscope}
\pgfpathmoveto{\pgfqpoint{0.815cm}{1.399cm}}
\pgfpathcurveto{\pgfqpoint{0.815cm}{1.435cm}}{\pgfqpoint{0.801cm}{1.47cm}}{\pgfqpoint{0.775cm}{1.496cm}}
\pgfpathcurveto{\pgfqpoint{0.75cm}{1.521cm}}{\pgfqpoint{0.715cm}{1.536cm}}{\pgfqpoint{0.679cm}{1.536cm}}
\pgfpathcurveto{\pgfqpoint{0.643cm}{1.536cm}}{\pgfqpoint{0.608cm}{1.521cm}}{\pgfqpoint{0.582cm}{1.496cm}}
\pgfpathcurveto{\pgfqpoint{0.557cm}{1.47cm}}{\pgfqpoint{0.542cm}{1.435cm}}{\pgfqpoint{0.542cm}{1.399cm}}
\pgfpathcurveto{\pgfqpoint{0.542cm}{1.363cm}}{\pgfqpoint{0.557cm}{1.328cm}}{\pgfqpoint{0.582cm}{1.302cm}}
\pgfpathcurveto{\pgfqpoint{0.608cm}{1.276cm}}{\pgfqpoint{0.643cm}{1.262cm}}{\pgfqpoint{0.679cm}{1.262cm}}
\pgfpathcurveto{\pgfqpoint{0.715cm}{1.262cm}}{\pgfqpoint{0.75cm}{1.276cm}}{\pgfqpoint{0.775cm}{1.302cm}}
\pgfpathcurveto{\pgfqpoint{0.801cm}{1.328cm}}{\pgfqpoint{0.815cm}{1.363cm}}{\pgfqpoint{0.815cm}{1.399cm}}
\pgfusepath{fill}
\pgfpathmoveto{\pgfqpoint{1.345cm}{1.371cm}}
\pgfpathcurveto{\pgfqpoint{1.345cm}{1.408cm}}{\pgfqpoint{1.331cm}{1.442cm}}{\pgfqpoint{1.305cm}{1.468cm}}
\pgfpathcurveto{\pgfqpoint{1.28cm}{1.494cm}}{\pgfqpoint{1.245cm}{1.508cm}}{\pgfqpoint{1.209cm}{1.508cm}}
\pgfpathcurveto{\pgfqpoint{1.172cm}{1.508cm}}{\pgfqpoint{1.138cm}{1.494cm}}{\pgfqpoint{1.112cm}{1.468cm}}
\pgfpathcurveto{\pgfqpoint{1.087cm}{1.442cm}}{\pgfqpoint{1.072cm}{1.408cm}}{\pgfqpoint{1.072cm}{1.371cm}}
\pgfpathcurveto{\pgfqpoint{1.072cm}{1.335cm}}{\pgfqpoint{1.087cm}{1.3cm}}{\pgfqpoint{1.112cm}{1.274cm}}
\pgfpathcurveto{\pgfqpoint{1.138cm}{1.249cm}}{\pgfqpoint{1.172cm}{1.234cm}}{\pgfqpoint{1.209cm}{1.234cm}}
\pgfpathcurveto{\pgfqpoint{1.245cm}{1.234cm}}{\pgfqpoint{1.28cm}{1.249cm}}{\pgfqpoint{1.305cm}{1.274cm}}
\pgfpathcurveto{\pgfqpoint{1.331cm}{1.3cm}}{\pgfqpoint{1.345cm}{1.335cm}}{\pgfqpoint{1.345cm}{1.371cm}}
\pgfusepath{fill}
\begin{pgfscope}
\pgfsetdash{}{0cm}
\pgfsetlinewidth{0.818mm}
\pgfsetroundcap
\pgfsetmiterlimit{4.0}
\pgfpathmoveto{\pgfqpoint{0.682cm}{0.671cm}}
\pgfpathlineto{\pgfqpoint{0.682cm}{0.042cm}}
\pgfusepath{stroke}
\end{pgfscope}
\end{pgfscope}
\end{pgfscope}
\end{pgfscope}
\end{tikzpicture}}}+\phi+\psi\|_{C^{(\alpha+\kappa)/2}L^{\infty}(\rho^{3+\gamma''})}+\|-X^{\!\resizebox{0.6em}{!}{
\begin{tikzpicture}
\pgfpathmoveto{\pgfqpoint{0cm}{-0.035cm}}
\pgfpathlineto{\pgfqpoint{1.376cm}{-0.035cm}}
\pgfpathlineto{\pgfqpoint{1.376cm}{1.552cm}}
\pgfpathlineto{\pgfqpoint{0cm}{1.552cm}}
\pgfpathclose
\pgfusepath{clip}
\begin{pgfscope}
\begin{pgfscope}
\pgfpathmoveto{\pgfqpoint{0cm}{-0.035cm}}
\pgfpathlineto{\pgfqpoint{1.376cm}{-0.035cm}}
\pgfpathlineto{\pgfqpoint{1.376cm}{1.552cm}}
\pgfpathlineto{\pgfqpoint{0cm}{1.552cm}}
\pgfpathclose
\pgfusepath{clip}
\begin{pgfscope}
\begin{pgfscope}
\pgfsetdash{}{0cm}
\pgfsetlinewidth{0.818mm}
\pgfsetroundcap
\pgfsetroundjoin
\pgfsetmiterlimit{7.0}
\definecolor{eps2pgf_color}{gray}{0}\pgfsetstrokecolor{eps2pgf_color}\pgfsetfillcolor{eps2pgf_color}
\pgfpathmoveto{\pgfqpoint{0.117cm}{1.421cm}}
\pgfpathlineto{\pgfqpoint{0.682cm}{0.671cm}}
\pgfpathlineto{\pgfqpoint{1.246cm}{1.421cm}}
\pgfusepath{stroke}
\end{pgfscope}
\definecolor{eps2pgf_color}{gray}{0}\pgfsetstrokecolor{eps2pgf_color}\pgfsetfillcolor{eps2pgf_color}
\pgfpathmoveto{\pgfqpoint{0.273cm}{1.395cm}}
\pgfpathcurveto{\pgfqpoint{0.273cm}{1.432cm}}{\pgfqpoint{0.259cm}{1.467cm}}{\pgfqpoint{0.233cm}{1.492cm}}
\pgfpathcurveto{\pgfqpoint{0.207cm}{1.518cm}}{\pgfqpoint{0.173cm}{1.532cm}}{\pgfqpoint{0.137cm}{1.532cm}}
\pgfpathcurveto{\pgfqpoint{0.1cm}{1.532cm}}{\pgfqpoint{0.066cm}{1.518cm}}{\pgfqpoint{0.04cm}{1.492cm}}
\pgfpathcurveto{\pgfqpoint{0.014cm}{1.467cm}}{\pgfqpoint{0cm}{1.432cm}}{\pgfqpoint{0cm}{1.395cm}}
\pgfpathcurveto{\pgfqpoint{0cm}{1.359cm}}{\pgfqpoint{0.014cm}{1.324cm}}{\pgfqpoint{0.04cm}{1.299cm}}
\pgfpathcurveto{\pgfqpoint{0.066cm}{1.273cm}}{\pgfqpoint{0.1cm}{1.258cm}}{\pgfqpoint{0.137cm}{1.258cm}}
\pgfpathcurveto{\pgfqpoint{0.173cm}{1.258cm}}{\pgfqpoint{0.207cm}{1.273cm}}{\pgfqpoint{0.233cm}{1.299cm}}
\pgfpathcurveto{\pgfqpoint{0.259cm}{1.324cm}}{\pgfqpoint{0.273cm}{1.359cm}}{\pgfqpoint{0.273cm}{1.395cm}}
\pgfusepath{fill}
\begin{pgfscope}
\pgfsetdash{}{0cm}
\pgfsetlinewidth{0.818mm}
\pgfsetmiterlimit{7.0}
\pgfpathmoveto{\pgfqpoint{0.682cm}{0.671cm}}
\pgfpathlineto{\pgfqpoint{0.679cm}{1.418cm}}
\pgfusepath{stroke}
\end{pgfscope}
\pgfpathmoveto{\pgfqpoint{0.815cm}{1.399cm}}
\pgfpathcurveto{\pgfqpoint{0.815cm}{1.435cm}}{\pgfqpoint{0.801cm}{1.47cm}}{\pgfqpoint{0.775cm}{1.496cm}}
\pgfpathcurveto{\pgfqpoint{0.75cm}{1.521cm}}{\pgfqpoint{0.715cm}{1.536cm}}{\pgfqpoint{0.679cm}{1.536cm}}
\pgfpathcurveto{\pgfqpoint{0.643cm}{1.536cm}}{\pgfqpoint{0.608cm}{1.521cm}}{\pgfqpoint{0.582cm}{1.496cm}}
\pgfpathcurveto{\pgfqpoint{0.557cm}{1.47cm}}{\pgfqpoint{0.542cm}{1.435cm}}{\pgfqpoint{0.542cm}{1.399cm}}
\pgfpathcurveto{\pgfqpoint{0.542cm}{1.363cm}}{\pgfqpoint{0.557cm}{1.328cm}}{\pgfqpoint{0.582cm}{1.302cm}}
\pgfpathcurveto{\pgfqpoint{0.608cm}{1.276cm}}{\pgfqpoint{0.643cm}{1.262cm}}{\pgfqpoint{0.679cm}{1.262cm}}
\pgfpathcurveto{\pgfqpoint{0.715cm}{1.262cm}}{\pgfqpoint{0.75cm}{1.276cm}}{\pgfqpoint{0.775cm}{1.302cm}}
\pgfpathcurveto{\pgfqpoint{0.801cm}{1.328cm}}{\pgfqpoint{0.815cm}{1.363cm}}{\pgfqpoint{0.815cm}{1.399cm}}
\pgfusepath{fill}
\pgfpathmoveto{\pgfqpoint{1.345cm}{1.371cm}}
\pgfpathcurveto{\pgfqpoint{1.345cm}{1.408cm}}{\pgfqpoint{1.331cm}{1.442cm}}{\pgfqpoint{1.305cm}{1.468cm}}
\pgfpathcurveto{\pgfqpoint{1.28cm}{1.494cm}}{\pgfqpoint{1.245cm}{1.508cm}}{\pgfqpoint{1.209cm}{1.508cm}}
\pgfpathcurveto{\pgfqpoint{1.172cm}{1.508cm}}{\pgfqpoint{1.138cm}{1.494cm}}{\pgfqpoint{1.112cm}{1.468cm}}
\pgfpathcurveto{\pgfqpoint{1.087cm}{1.442cm}}{\pgfqpoint{1.072cm}{1.408cm}}{\pgfqpoint{1.072cm}{1.371cm}}
\pgfpathcurveto{\pgfqpoint{1.072cm}{1.335cm}}{\pgfqpoint{1.087cm}{1.3cm}}{\pgfqpoint{1.112cm}{1.274cm}}
\pgfpathcurveto{\pgfqpoint{1.138cm}{1.249cm}}{\pgfqpoint{1.172cm}{1.234cm}}{\pgfqpoint{1.209cm}{1.234cm}}
\pgfpathcurveto{\pgfqpoint{1.245cm}{1.234cm}}{\pgfqpoint{1.28cm}{1.249cm}}{\pgfqpoint{1.305cm}{1.274cm}}
\pgfpathcurveto{\pgfqpoint{1.331cm}{1.3cm}}{\pgfqpoint{1.345cm}{1.335cm}}{\pgfqpoint{1.345cm}{1.371cm}}
\pgfusepath{fill}
\begin{pgfscope}
\pgfsetdash{}{0cm}
\pgfsetlinewidth{0.818mm}
\pgfsetroundcap
\pgfsetmiterlimit{4.0}
\pgfpathmoveto{\pgfqpoint{0.682cm}{0.671cm}}
\pgfpathlineto{\pgfqpoint{0.682cm}{0.042cm}}
\pgfusepath{stroke}
\end{pgfscope}
\end{pgfscope}
\end{pgfscope}
\end{pgfscope}
\end{tikzpicture}}}+\phi+\psi\|_{C_{T}\CC^{\alpha+\kappa}(\rho^{3+\gamma''})}  \Big) \|X^{\!\resizebox{0.6em}{!}{
\begin{tikzpicture}
\pgfpathmoveto{\pgfqpoint{0cm}{0cm}}
\pgfpathlineto{\pgfqpoint{1.376cm}{0cm}}
\pgfpathlineto{\pgfqpoint{1.376cm}{1.588cm}}
\pgfpathlineto{\pgfqpoint{0cm}{1.588cm}}
\pgfpathclose
\pgfusepath{clip}
\begin{pgfscope}
\begin{pgfscope}
\pgfpathmoveto{\pgfqpoint{0cm}{0cm}}
\pgfpathlineto{\pgfqpoint{1.376cm}{0cm}}
\pgfpathlineto{\pgfqpoint{1.376cm}{1.588cm}}
\pgfpathlineto{\pgfqpoint{0cm}{1.588cm}}
\pgfpathclose
\pgfusepath{clip}
\begin{pgfscope}
\begin{pgfscope}
\definecolor{eps2pgf_color}{gray}{0.976471}\pgfsetstrokecolor{eps2pgf_color}\pgfsetfillcolor{eps2pgf_color}
\pgfpathmoveto{\pgfqpoint{0cm}{0cm}}
\pgfpathlineto{\pgfqpoint{1.376cm}{0cm}}
\pgfpathlineto{\pgfqpoint{1.376cm}{1.588cm}}
\pgfpathlineto{\pgfqpoint{0cm}{1.588cm}}
\pgfpathclose
\pgfusepath{fill}
\end{pgfscope}
\begin{pgfscope}
\pgfsetdash{}{0cm}
\pgfsetlinewidth{0.818mm}
\pgfsetroundcap
\pgfsetroundjoin
\pgfsetmiterlimit{7.0}
\definecolor{eps2pgf_color}{gray}{0}\pgfsetstrokecolor{eps2pgf_color}\pgfsetfillcolor{eps2pgf_color}
\pgfpathmoveto{\pgfqpoint{0.117cm}{1.476cm}}
\pgfpathlineto{\pgfqpoint{0.682cm}{0.726cm}}
\pgfpathlineto{\pgfqpoint{1.246cm}{1.476cm}}
\pgfusepath{stroke}
\end{pgfscope}
\definecolor{eps2pgf_color}{gray}{0}\pgfsetstrokecolor{eps2pgf_color}\pgfsetfillcolor{eps2pgf_color}
\pgfpathmoveto{\pgfqpoint{0.273cm}{1.451cm}}
\pgfpathcurveto{\pgfqpoint{0.273cm}{1.487cm}}{\pgfqpoint{0.259cm}{1.522cm}}{\pgfqpoint{0.233cm}{1.547cm}}
\pgfpathcurveto{\pgfqpoint{0.207cm}{1.573cm}}{\pgfqpoint{0.173cm}{1.588cm}}{\pgfqpoint{0.137cm}{1.588cm}}
\pgfpathcurveto{\pgfqpoint{0.1cm}{1.588cm}}{\pgfqpoint{0.066cm}{1.573cm}}{\pgfqpoint{0.04cm}{1.547cm}}
\pgfpathcurveto{\pgfqpoint{0.014cm}{1.522cm}}{\pgfqpoint{0cm}{1.487cm}}{\pgfqpoint{0cm}{1.451cm}}
\pgfpathcurveto{\pgfqpoint{0cm}{1.414cm}}{\pgfqpoint{0.014cm}{1.379cm}}{\pgfqpoint{0.04cm}{1.354cm}}
\pgfpathcurveto{\pgfqpoint{0.066cm}{1.328cm}}{\pgfqpoint{0.1cm}{1.314cm}}{\pgfqpoint{0.137cm}{1.314cm}}
\pgfpathcurveto{\pgfqpoint{0.173cm}{1.314cm}}{\pgfqpoint{0.207cm}{1.328cm}}{\pgfqpoint{0.233cm}{1.354cm}}
\pgfpathcurveto{\pgfqpoint{0.259cm}{1.379cm}}{\pgfqpoint{0.273cm}{1.414cm}}{\pgfqpoint{0.273cm}{1.451cm}}
\pgfusepath{fill}
\pgfpathmoveto{\pgfqpoint{1.345cm}{1.426cm}}
\pgfpathcurveto{\pgfqpoint{1.345cm}{1.463cm}}{\pgfqpoint{1.331cm}{1.497cm}}{\pgfqpoint{1.305cm}{1.523cm}}
\pgfpathcurveto{\pgfqpoint{1.28cm}{1.549cm}}{\pgfqpoint{1.245cm}{1.563cm}}{\pgfqpoint{1.209cm}{1.563cm}}
\pgfpathcurveto{\pgfqpoint{1.172cm}{1.563cm}}{\pgfqpoint{1.138cm}{1.549cm}}{\pgfqpoint{1.112cm}{1.523cm}}
\pgfpathcurveto{\pgfqpoint{1.087cm}{1.497cm}}{\pgfqpoint{1.072cm}{1.463cm}}{\pgfqpoint{1.072cm}{1.426cm}}
\pgfpathcurveto{\pgfqpoint{1.072cm}{1.39cm}}{\pgfqpoint{1.087cm}{1.355cm}}{\pgfqpoint{1.112cm}{1.329cm}}
\pgfpathcurveto{\pgfqpoint{1.138cm}{1.304cm}}{\pgfqpoint{1.172cm}{1.289cm}}{\pgfqpoint{1.209cm}{1.289cm}}
\pgfpathcurveto{\pgfqpoint{1.245cm}{1.289cm}}{\pgfqpoint{1.28cm}{1.304cm}}{\pgfqpoint{1.305cm}{1.329cm}}
\pgfpathcurveto{\pgfqpoint{1.331cm}{1.355cm}}{\pgfqpoint{1.345cm}{1.39cm}}{\pgfqpoint{1.345cm}{1.426cm}}
\pgfusepath{fill}
\begin{pgfscope}
\pgfsetdash{}{0cm}
\pgfsetlinewidth{0.818mm}
\pgfsetroundcap
\pgfsetmiterlimit{4.0}
\pgfpathmoveto{\pgfqpoint{0.682cm}{0.726cm}}
\pgfpathlineto{\pgfqpoint{0.682cm}{0.097cm}}
\pgfusepath{stroke}
\end{pgfscope}
\end{pgfscope}
\end{pgfscope}
\end{pgfscope}
\end{tikzpicture}}}\|_{C\CC^{1-\kappa}(\rho^{\sigma})}\\
&\quad \lesssim 1 +  \|\phi+\psi\|_{C^{( \alpha+\kappa)/2}L^{\infty}(\rho^{ 3+\gamma''})}+\|\phi+\psi\|_{C\CC^{\frac 12 +\alpha}(\rho^{\frac 32+\alpha})},
\end{align*}
where $0<\gamma''<\gamma'$ and we chose $\alpha$ and $\kappa$ sufficiently small such that  $\alpha+\kappa<\delta$ for $\delta$ from Theorem \ref{thm:renorm43}.
All the other terms in $\Theta$ can be estimated pointwise in time by the same approach as in Section \ref{ssec:theta}.
Therefore, it only remains to bound the time regularity of $\phi+\psi$. Choosing $\gamma''$ sufficiently large and using Lemma \ref{lemma:interp2}, we obtain for $\lambda\in (0,1)$
\begin{align*}
\|\phi+\psi\|_{C^{( \alpha+\kappa)/2}L^{\infty}(\rho^{ 3+\gamma''})}&\lesssim \|\phi+\psi\|_{L^{\8}L^{\infty}(\rho)}+\lambda\|\phi+\psi\|_{C^{(\frac 12+ \alpha)/2}L^{\infty}(\rho^{ 3+\gamma})}\\
&\lesssim \|\phi+\psi\|_{L^{\8}L^{\infty}(\rho)}+\|\phi\|_{C^{(\frac 12+ \alpha)/2}L^{\infty}(\rho^{ \frac 32+\alpha})}+\lambda\|\psi\|_{C^{1}L^{\infty}(\rho^{  3+\gamma})}.
\end{align*}
Observe that the small parameter $\lambda$ is only needed in order to absorb the $C^{1}$-norm of $\psi$ into the left hand side.
And the same bound holds true for $\phi+\psi\in C^{( \alpha+\kappa)/2}L^{\infty}(\rho^{ 3+\gamma'})$ needed to estimate the additional term in $\rmb{\tilde\Psi}$.
This is sufficient in order to obtain the desired uniform bounds for $\phi,\vartheta,\psi$.

To be more precise, as in the case of the parabolic $\Phi^{4}$ model in dimension 2, we show existence via a smooth approximation and compactness. To this end, let $\xi_{\varepsilon}$ be a smooth and periodic approximation  of the driving space-time white noise $\xi$, defined on the torus of size $\frac 1\varepsilon$.
If  $\varphi_{\varepsilon,0}$ is a smooth approximation  of the initial condition $\varphi_{0}$, then according to Proposition \ref{prop:42d}, for every $\varepsilon\in (0,1)$ there exists $\varphi_{\varepsilon}\in C^{\infty}([0,T]\times \mathbb{T}^{3}_{1/\varepsilon})$ which is the unique classical solution to
\begin{equation*}
\LL \varphi_{\varepsilon} + \varphi_{\varepsilon}^3+ ( - 3 a_{\varepsilon} +3 b_{\varepsilon})\varphi_{\varepsilon} - \xi_{\varepsilon} = 0,\qquad\varphi(0)=\varphi_{\varepsilon,0}.
\end{equation*}
Now, we proceed with the same decomposition $\varphi_{\varepsilon}=X_{\varepsilon} - X_{\varepsilon}^{\!\resizebox{0.6em}{!}{
\begin{tikzpicture}
\pgfpathmoveto{\pgfqpoint{0cm}{-0.035cm}}
\pgfpathlineto{\pgfqpoint{1.376cm}{-0.035cm}}
\pgfpathlineto{\pgfqpoint{1.376cm}{1.552cm}}
\pgfpathlineto{\pgfqpoint{0cm}{1.552cm}}
\pgfpathclose
\pgfusepath{clip}
\begin{pgfscope}
\begin{pgfscope}
\pgfpathmoveto{\pgfqpoint{0cm}{-0.035cm}}
\pgfpathlineto{\pgfqpoint{1.376cm}{-0.035cm}}
\pgfpathlineto{\pgfqpoint{1.376cm}{1.552cm}}
\pgfpathlineto{\pgfqpoint{0cm}{1.552cm}}
\pgfpathclose
\pgfusepath{clip}
\begin{pgfscope}
\begin{pgfscope}
\pgfsetdash{}{0cm}
\pgfsetlinewidth{0.818mm}
\pgfsetroundcap
\pgfsetroundjoin
\pgfsetmiterlimit{7.0}
\definecolor{eps2pgf_color}{gray}{0}\pgfsetstrokecolor{eps2pgf_color}\pgfsetfillcolor{eps2pgf_color}
\pgfpathmoveto{\pgfqpoint{0.117cm}{1.421cm}}
\pgfpathlineto{\pgfqpoint{0.682cm}{0.671cm}}
\pgfpathlineto{\pgfqpoint{1.246cm}{1.421cm}}
\pgfusepath{stroke}
\end{pgfscope}
\definecolor{eps2pgf_color}{gray}{0}\pgfsetstrokecolor{eps2pgf_color}\pgfsetfillcolor{eps2pgf_color}
\pgfpathmoveto{\pgfqpoint{0.273cm}{1.395cm}}
\pgfpathcurveto{\pgfqpoint{0.273cm}{1.432cm}}{\pgfqpoint{0.259cm}{1.467cm}}{\pgfqpoint{0.233cm}{1.492cm}}
\pgfpathcurveto{\pgfqpoint{0.207cm}{1.518cm}}{\pgfqpoint{0.173cm}{1.532cm}}{\pgfqpoint{0.137cm}{1.532cm}}
\pgfpathcurveto{\pgfqpoint{0.1cm}{1.532cm}}{\pgfqpoint{0.066cm}{1.518cm}}{\pgfqpoint{0.04cm}{1.492cm}}
\pgfpathcurveto{\pgfqpoint{0.014cm}{1.467cm}}{\pgfqpoint{0cm}{1.432cm}}{\pgfqpoint{0cm}{1.395cm}}
\pgfpathcurveto{\pgfqpoint{0cm}{1.359cm}}{\pgfqpoint{0.014cm}{1.324cm}}{\pgfqpoint{0.04cm}{1.299cm}}
\pgfpathcurveto{\pgfqpoint{0.066cm}{1.273cm}}{\pgfqpoint{0.1cm}{1.258cm}}{\pgfqpoint{0.137cm}{1.258cm}}
\pgfpathcurveto{\pgfqpoint{0.173cm}{1.258cm}}{\pgfqpoint{0.207cm}{1.273cm}}{\pgfqpoint{0.233cm}{1.299cm}}
\pgfpathcurveto{\pgfqpoint{0.259cm}{1.324cm}}{\pgfqpoint{0.273cm}{1.359cm}}{\pgfqpoint{0.273cm}{1.395cm}}
\pgfusepath{fill}
\begin{pgfscope}
\pgfsetdash{}{0cm}
\pgfsetlinewidth{0.818mm}
\pgfsetmiterlimit{7.0}
\pgfpathmoveto{\pgfqpoint{0.682cm}{0.671cm}}
\pgfpathlineto{\pgfqpoint{0.679cm}{1.418cm}}
\pgfusepath{stroke}
\end{pgfscope}
\pgfpathmoveto{\pgfqpoint{0.815cm}{1.399cm}}
\pgfpathcurveto{\pgfqpoint{0.815cm}{1.435cm}}{\pgfqpoint{0.801cm}{1.47cm}}{\pgfqpoint{0.775cm}{1.496cm}}
\pgfpathcurveto{\pgfqpoint{0.75cm}{1.521cm}}{\pgfqpoint{0.715cm}{1.536cm}}{\pgfqpoint{0.679cm}{1.536cm}}
\pgfpathcurveto{\pgfqpoint{0.643cm}{1.536cm}}{\pgfqpoint{0.608cm}{1.521cm}}{\pgfqpoint{0.582cm}{1.496cm}}
\pgfpathcurveto{\pgfqpoint{0.557cm}{1.47cm}}{\pgfqpoint{0.542cm}{1.435cm}}{\pgfqpoint{0.542cm}{1.399cm}}
\pgfpathcurveto{\pgfqpoint{0.542cm}{1.363cm}}{\pgfqpoint{0.557cm}{1.328cm}}{\pgfqpoint{0.582cm}{1.302cm}}
\pgfpathcurveto{\pgfqpoint{0.608cm}{1.276cm}}{\pgfqpoint{0.643cm}{1.262cm}}{\pgfqpoint{0.679cm}{1.262cm}}
\pgfpathcurveto{\pgfqpoint{0.715cm}{1.262cm}}{\pgfqpoint{0.75cm}{1.276cm}}{\pgfqpoint{0.775cm}{1.302cm}}
\pgfpathcurveto{\pgfqpoint{0.801cm}{1.328cm}}{\pgfqpoint{0.815cm}{1.363cm}}{\pgfqpoint{0.815cm}{1.399cm}}
\pgfusepath{fill}
\pgfpathmoveto{\pgfqpoint{1.345cm}{1.371cm}}
\pgfpathcurveto{\pgfqpoint{1.345cm}{1.408cm}}{\pgfqpoint{1.331cm}{1.442cm}}{\pgfqpoint{1.305cm}{1.468cm}}
\pgfpathcurveto{\pgfqpoint{1.28cm}{1.494cm}}{\pgfqpoint{1.245cm}{1.508cm}}{\pgfqpoint{1.209cm}{1.508cm}}
\pgfpathcurveto{\pgfqpoint{1.172cm}{1.508cm}}{\pgfqpoint{1.138cm}{1.494cm}}{\pgfqpoint{1.112cm}{1.468cm}}
\pgfpathcurveto{\pgfqpoint{1.087cm}{1.442cm}}{\pgfqpoint{1.072cm}{1.408cm}}{\pgfqpoint{1.072cm}{1.371cm}}
\pgfpathcurveto{\pgfqpoint{1.072cm}{1.335cm}}{\pgfqpoint{1.087cm}{1.3cm}}{\pgfqpoint{1.112cm}{1.274cm}}
\pgfpathcurveto{\pgfqpoint{1.138cm}{1.249cm}}{\pgfqpoint{1.172cm}{1.234cm}}{\pgfqpoint{1.209cm}{1.234cm}}
\pgfpathcurveto{\pgfqpoint{1.245cm}{1.234cm}}{\pgfqpoint{1.28cm}{1.249cm}}{\pgfqpoint{1.305cm}{1.274cm}}
\pgfpathcurveto{\pgfqpoint{1.331cm}{1.3cm}}{\pgfqpoint{1.345cm}{1.335cm}}{\pgfqpoint{1.345cm}{1.371cm}}
\pgfusepath{fill}
\begin{pgfscope}
\pgfsetdash{}{0cm}
\pgfsetlinewidth{0.818mm}
\pgfsetroundcap
\pgfsetmiterlimit{4.0}
\pgfpathmoveto{\pgfqpoint{0.682cm}{0.671cm}}
\pgfpathlineto{\pgfqpoint{0.682cm}{0.042cm}}
\pgfusepath{stroke}
\end{pgfscope}
\end{pgfscope}
\end{pgfscope}
\end{pgfscope}
\end{tikzpicture}}} +\phi_{\varepsilon}+\psi_{\varepsilon}$ as above, only starting from the mollified noise $\xi_{\varepsilon}$ instead of  $\xi$. 
Next, in view of the bounds from Theorem \ref{thm:renorm43}, the a priori estimates from Sections~\ref{ssec:phi1}, \ref{ssec:phi2}, \ref{ssec:theta}, \ref{ssec:psi1}, \ref{ssec:psi2} apply mutatis mutandis  (using  Lemmas~\ref{lemma:interp2}, \ref{lem:local2}, \ref{lemma:sch}, \ref{lemma:schauder-par}, \ref{lemma:apriori-parabolic}), with only slight modification due to the necessary time regularity needed for the additional commutators in $\rmb{\tilde\Psi}$ and $\Theta$.

To summarize, we deduce that the following bounds hold true uniformly in $\varepsilon\in (0,1)$
\begin{align}\label{eq:un43}
\begin{aligned}
\|\phi_{\varepsilon}\|_{C\CC^{\alpha}(\rho)}+\|\phi_{\varepsilon}\|_{C^{\alpha/2}L^{\infty}(\rho)}+\|\phi_{\varepsilon}\|_{C\CC^{\frac 12+\alpha}(\rho^{\frac 32+\alpha})}+\|\phi_{\varepsilon}\|_{C^{(\frac 12+\alpha)/2}L^{\infty}(\rho^{\frac 32+\alpha})}&\lesssim 1,\\
\|\vartheta_{\varepsilon}\|_{C\CC^{1+\alpha}(\rho^{3+\gamma'})}+\|\vartheta_{\varepsilon}\|_{C^{(1+\alpha)/2}L^{\infty}(\rho^{3+\gamma'})}&\lesssim 1\\
\|\psi_{\varepsilon}\|_{C\CC^{2+\gamma}(\rho^{3+\gamma})}+\|\psi_{\varepsilon}\|_{C^{1}L^{\infty}(\rho^{3+\gamma})}+\|\psi_{\varepsilon}\|_{L^{\infty}L^{\infty}(\rho)}&\lesssim 1.
\end{aligned}
\end{align}

Consequently, we are able  to pass to the limit.

\begin{theorem}\label{thm:ex3d}
Let $\kappa, \alpha\in (0,1)$ be chosen sufficiently small and let $\gamma=\alpha-\kappa>0$ and $\gamma'\in(0,\gamma)$ sufficiently large. If $\varphi_{0}\in \CC^{1+\alpha}(\rho^{3+\gamma'}_{0})$ then there exist
\begin{align*}
\phi&\in C\CC^{\alpha}(\rho)\cap C^{\alpha/2}L^{\infty}(\rho)\cap C\CC^{\frac 12+\alpha}(\rho^{\frac 32+\alpha})\cap C^{(\frac 12+\alpha)/2}L^{\infty}(\rho^{\frac 32+\alpha}),\\
\vartheta&\in C\CC^{1+\alpha}(\rho^{3+\gamma'}) \cap C^{(1+\alpha)/2}L^{\infty}(\rho^{3+\gamma'}),\\
\psi&\in C\CC^{2+\gamma}(\rho^{3+\gamma})\cap C^{1}L^{\infty}(\rho^{3+\gamma})\cap L^{\infty}L^{\infty}(\rho),
\end{align*}
which is a solution to  \eqref{eq:two43}, \eqref{eq:th43}.
\end{theorem}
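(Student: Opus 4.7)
The plan is to mirror the approximation/compactness scheme used in the proof of Theorem~\ref{thm:ex42}, but with the richer decomposition of Section~\ref{sec:45} and the time-mollified paracontrolled ansatz \eqref{eq:th43}. First, I would fix a smooth space-time mollification $\xi_\varepsilon$ of $\xi$ defined on $[0,\infty)\times\mathbb{T}^3_{1/\varepsilon}$, together with a smooth approximation $\varphi_{\varepsilon,0}$ of the initial datum $\varphi_0$ (chosen to converge in $\CC^{1+\alpha}(\rho_0^{3+\gamma''})$ for some $\gamma'<\gamma''<\gamma$). Proposition~\ref{prop:42d} then yields, for every $\varepsilon\in(0,1)$, a unique classical smooth solution $\varphi_\varepsilon$ to the approximate renormalized equation with constants $a_\varepsilon$, $b_\varepsilon(t)$ from Section~\ref{ssec:renorm-par}. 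On the approximate level I would set $\varphi_\varepsilon = X_\varepsilon - \tthreeone{X_\varepsilon}+\phi_\varepsilon+\psi_\varepsilon$, define $\vartheta_\varepsilon$ by \eqref{eq:th43} with the stochastic objects replaced by their smooth counterparts, and verify that $(\phi_\varepsilon,\psi_\varepsilon)$ satisfies the system \eqref{eq:two43}, \eqref{eq:two43aa} with all bounds derived in the text.

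Next, I would collect the uniform bounds \eqref{eq:un43}. These follow by running the a priori arguments of Sections~\ref{ssec:phi1}--\ref{ssec:psi2} pointwise in time for the spatial estimates (Schauder and coercive estimates are used through Lemma~\ref{lemma:schauder-par} and Lemma~\ref{lemma:apriori-parabolic}, with localizers replaced by their parabolic versions from Lemma~\ref{lem:local2}), while the two new commutator terms in $\Theta$ and in $\rmb{\tilde\Psi}$ are controlled via Lemma~\ref{lem:5.1} using the H\"older-in-time norm $\|\cdot\|_{C^{(\alpha+\kappa)/2}L^\infty(\rho^{3+\gamma''})}$. The time regularity of $\phi_\varepsilon+\psi_\varepsilon$ in this weighted norm is closed using the interpolation estimate of Lemma~\ref{lemma:interp2} together with the sharp $C^1L^\infty(\rho^{3+\gamma})$ bound on $\psi_\varepsilon$ coming from the parabolic Schauder estimate, absorbing the $\psi_\varepsilon$ contribution into the left-hand side via a small parameter $\lambda$ (this is exactly why the higher weight $\rho^{3+\gamma'}$ for $\vartheta$ is needed, since the time interpolation is only sub-optimal here compared to the elliptic case).

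Having \eqref{eq:un43} in hand, I would apply the compact embedding \eqref{eq:emb} in space combined with an Aubin--Lions/Arzel\`a-Ascoli argument in time (see \cite[Lemma~1, Theorem~5]{S87}) to extract, along a subsequence, limits $(\phi,\vartheta,\psi)$ in slightly weaker weighted spaces of the form $C_{\rm loc}\CC^{\alpha-\iota}(\rho^{1+\delta})\cap C^{(\alpha-\iota)/2}_{\rm loc}\CC^{-\gamma}(\rho^{1+\delta})$ and analogous spaces for $\vartheta$ and $\psi$. Theorem~\ref{thm:renorm43} guarantees that all smooth stochastic data converge in the relevant $C\CC^\cdot(\rho^\sigma)\cap C^{\delta/2}\CC^{\cdot-\gamma}(\rho^\sigma)$ topologies to their renormalized limits, so  the magenta and blue terms $\rmm{\Phi}$, $\rmb{\tilde\Psi}$, $\Theta$ converge in weighted spaces of sufficiently low regularity. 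Passing to the limit in the distributional form of \eqref{eq:two43}, \eqref{eq:th43} is then routine. Finally, as in the proof of Theorem~\ref{thm:ex42}, weak-$*$ lower semicontinuity of the $L^\infty$ norm applied to each Littlewood--Paley block $\Delta_i$, together with the uniform bounds, upgrades the limit to the full claimed regularity $(\phi,\vartheta,\psi)$ in the spaces of the statement; continuity in time is then recovered by re-applying Lemma~\ref{lemma:sch} and Lemma~\ref{lemma:schauder-par} to the limiting equations.

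The main obstacle is the closing of the uniform estimate \eqref{eq:un43} in the time direction. Unlike in the elliptic setting of Section~\ref{sec:45}, the commutator $[\LL,(-\tthreeone{X}+\phi+\psi)\Prec]\ttwoone{X}$ forces a genuinely parabolic H\"older norm to appear on the right-hand side, and the standard parabolic Schauder bound for $\psi$ only provides $C^1L^\infty$ regularity (never going past the optimal parabolic scaling). The delicate point is that Lemma~\ref{lemma:interp2} together with the higher weight $\rho^{3+\gamma'}$ just barely permits us to interpolate the $C^{(\alpha+\kappa)/2}L^\infty$ norm between $L^\infty L^\infty(\rho)$ and $C^{(2+\kappa)/2}L^\infty(\rho^{3+\kappa})$ and absorb the troublesome $\|\psi\|_{C^1L^\infty(\rho^{3+\gamma})}$ back into the $\psi$ estimate with a small prefactor $\lambda$; this interplay between the weight and the time regularity is the subtle ingredient that makes the scheme of Section~\ref{sec:45} work in the parabolic $d=3$ setting.
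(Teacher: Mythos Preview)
Your proposal is correct and follows essentially the same approach as the paper: the paper's own proof is a three-sentence sketch that refers back to the scheme of Theorem~\ref{thm:ex42}, relying on the uniform bounds~\eqref{eq:un43} (derived in the discussion preceding the theorem), compactness, Theorem~\ref{thm:renorm43} for convergence of the stochastic data, and a final upgrade of regularity for the limit. You have accurately fleshed out these steps, and in particular you have correctly identified the key parabolic subtlety---closing the $C^{(\alpha+\kappa)/2}L^\infty$ estimate on $\phi+\psi$ via the higher weight $\rho^{3+\gamma'}$ on $\vartheta$ and the small parameter $\lambda$ absorbing $\|\psi\|_{C^1L^\infty(\rho^{3+\gamma})}$---which is exactly the mechanism the paper uses in the paragraphs before the theorem statement.
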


\begin{proof}
The proof follows the lines of Theorem \ref{thm:ex42}. Based on the uniform bounds from \eqref{eq:un43} we obtain compactness of the sequence of approximate solutions $(\phi_{\varepsilon},\vartheta_{\varepsilon},\psi_{\varepsilon})$ in a slightly worse space. In view of Theorem \ref{thm:renorm43}, this allows us to pass to the limit in the approximate version of \eqref{eq:two43}, \eqref{eq:th43}. Finally, we obtain that the limit solutions belong to the spaces where the uniform bounds hold.
\end{proof}

\section{Uniqueness for the parabolic models}
\label{s:uniq}

This  section is concerned with uniqueness to the parabolic
$\Phi^4_d$ model for $d = 2, 3$.

\begin{theorem}
  \label{thm:uniq3d}
The parabolic $\Phi^{4}$ model \eqref{eq:phi4p} in dimension 2 and 3 has a unique solution: Let $(\phi,\psi)$, $(\tilde\phi,\tilde\psi)$ be two solutions in the sense of Theorem \ref{thm:ex42} and Theorem \ref{thm:ex3d}, respectively, starting from an initial condition $\phi_{0}+\psi_{0}=\tilde\phi_{0}+\tilde\psi_{0}=\varphi_{0}$. Then $\phi+\psi=\tilde\phi+\tilde\psi$.
\end{theorem}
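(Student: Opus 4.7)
The plan is to derive a closed equation for the difference
$v\assign (\phi+\psi)-(\tilde\phi+\tilde\psi)$ and to propagate the vanishing of $v(0)=0$ by a weighted $L^{2}$ energy estimate, exploiting the good sign of the cubic nonlinearity. In dimension two, summing the two components of the decomposed system in Section~\ref{sec:42} (and noticing that $\Phi+\Psi_1+\Psi_2+\psi^3 = \llbracket X^3\rrbracket + 3\,w\,\llbracket X^2\rrbracket + 3 X w^{2} + w^{3}$ with $w\assign\phi+\psi$), one obtains
\[
\LL v + 3\,\llbracket X^2\rrbracket\, v + 3 X\, v(w+\tilde w) + v\bigl(w^{2} + w\tilde w + \tilde w^{2}\bigr) = 0,\qquad v(0)=0,
\]
where $\tilde w\assign\tilde\phi+\tilde\psi$. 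The algebraic identity $a^{2}+ab+b^{2}=\tfrac{3}{4}(a+b)^{2}+\tfrac{1}{4}(a-b)^{2}\ge 0$ ensures that the quadratic factor multiplying $v^{2}$ produces a nonnegative cubic damping when we test against $v$, so the entire difficulty reduces to controlling the two linear contributions carrying the singular noises $X$ and $\llbracket X^{2}\rrbracket$.

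To test in $L^{2}$ I would introduce the space-time exponential weight $\rho(t,x)=e^{-\lambda(t)\langle x\rangle}$ of Section~\ref{ss:b}, with $\lambda$ increasing in time, and work in the $L^{2}$-scale weighted Besov spaces based on Gevrey test functions that are set up in that section. The crucial feature of exponential weights is that $|\nabla\rho|/\rho$ and $|\Delta\rho|/\rho$ are bounded by a multiple of $\lambda(t)$, so integrating by parts in $\int v(-\Delta v)\rho^{2}$ yields $\int \rho^{2}|\nabla v|^{2}$ up to lower-order terms that are absorbed for free, and paraproducts $f\prec g$ can be estimated with matching weight on both sides rather than losing weight from one factor to the other, a mechanism that failed for polynomial weights. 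Pairing the equation for $v$ against $v\rho^{2}$ and treating the singular terms $\int \llbracket X^{2}\rrbracket v^{2}\rho^{2}$ and $\int X v^{2}(w+\tilde w)\rho^{2}$ via the Bony decomposition together with Lemma~\ref{lem:para} and Lemma~\ref{lem:com}, one should reach an inequality of the shape
\[
\tfrac12\tfrac{\mathd}{\mathd t}\|\rho v\|_{L^{2}}^{2} + \|\rho\nabla v\|_{L^{2}}^{2} + \int v^{2}(w^{2}+w\tilde w+\tilde w^{2})\rho^{2}\,\mathd x \le C(t)\,\|\rho v\|_{H^{\kappa}}^{2},
\]
where $C(t)$ is deterministic and depends only on the enhanced noise and on the $L^{\infty}(\rho)$-norms of $w,\tilde w$ from Theorem~\ref{thm:ex42}. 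Interpolating $\|\rho v\|_{H^{\kappa}}^{2}\le \varepsilon\|\rho\nabla v\|_{L^{2}}^{2}+C_{\varepsilon}\|\rho v\|_{L^{2}}^{2}$, absorbing the gradient term into the left-hand side, and applying Gronwall with $v(0)=0$ then forces $v\equiv 0$ on every bounded time interval.

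In dimension three one must first mimic the paracontrolled ansatz of Section~\ref{sec:43} for the difference itself. Since $\phi+\psi$ and $\tilde\phi+\tilde\psi$ are both paracontrolled by $\ttwoone{X}$, subtracting the two representations gives
\[
v = v^{\sharp} - 3\,v\Prec\ttwoone{X} + (\text{corrections involving }\vartheta-\tilde\vartheta,\ \psi-\tilde\psi),
\]
with $v^{\sharp}$ of regularity $1+\alpha$. Substituting this ansatz into the equation for $v$ and using the commutator bound of Lemma~\ref{lem:5.1} together with the stochastic objects $\ttwothreer{X}$, $\tthreetwor{X}$, $\tthreethreer{X}$ produces a remainder regular enough to be paired with $\llbracket X^{2}\rrbracket$ in the classical sense, so that the weighted $L^{2}$ energy argument above extends to $d=3$ up to the bookkeeping of the paracontrolled pieces. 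The main obstacle, and precisely the motivation for the exponential weight, is this: in the $L^{2}$-scale each application of a paraproduct or a commutator in \emph{polynomial} weighted spaces would require a strictly larger weight on one side than on the other, breaking the Gronwall closure; the exponential weight avoids this loss entirely, while the Gevrey framework of Section~\ref{ss:b} is what makes the paradifferential calculus of Section~\ref{s:prel} still available in this modified functional setting.
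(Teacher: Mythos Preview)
Your plan has the right ingredients---an $L^{2}$ energy estimate in exponentially weighted spaces and, for $d=3$, a paracontrolled ansatz for the difference---but the mechanism you invoke for the exponential weight is not the correct one, and the $d=3$ argument is missing its essential two-component structure.

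\textbf{On the weight mechanism.} Your claim that ``paraproducts $f\prec g$ can be estimated with matching weight on both sides'' under the exponential weight is not how the argument closes. The stochastic objects $X,\llbracket X^{2}\rrbracket,\dots$ live only in \emph{polynomially} weighted spaces $\CC^{\alpha}(\langle x\rangle^{-\sigma})$, so every paraproduct estimate (Lemma~\ref{lem:par1}) still costs a polynomial factor $\langle x\rangle^{-\sigma}$ on the output, regardless of whether the solution factor carries an exponential weight. What actually rescues the estimate is the \emph{time derivative} of the weight: with $\pi(t,x)=e^{-t\langle x\rangle^{b}}$ (the paper's choice, $b\in(0,1)$), one has $\partial_{t}\pi=-\langle x\rangle^{b}\pi$, and testing $\Delta_{k}\zeta$ against $\pi^{2}\Delta_{k}\zeta$ produces a \emph{good} term $\|\langle x\rangle^{b/2}\Delta_{k}\zeta\|_{L^{2}(\pi)}^{2}$ on the left-hand side with an \emph{exploding} spatial weight. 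It is this extra coercivity, interpolated against the gradient term, that absorbs the polynomial loss coming from the noise. Your weight $e^{-\lambda(t)\langle x\rangle}$ with $\lambda$ increasing would yield an analogous term $\lambda'(t)\|\langle x\rangle^{1/2}v\|_{L^{2}(\pi)}^{2}$, but you never invoke it; instead you attribute the closure to a structural property of paraproducts that does not hold. Relatedly, the cubic damping $w^{2}+w\tilde w+\tilde w^{2}\ge 0$ is harmless but useless here: it gives a nonnegative term you can drop, not coercivity that controls anything. The paper says this explicitly and discards the cubic structure entirely.

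\textbf{On the $d=3$ structure.} In three dimensions a single energy estimate for $v$ cannot close because $\llbracket X^{2}\rrbracket\in\CC^{-1-\kappa}$ forces the resonant product $\llbracket X^{2}\rrbracket\circ v$ to require regularity $>1$ on $v$, which $v$ does not have. The paper runs \emph{two coupled} energy estimates: one for $\zeta=v$ in $B^{-\beta}_{2,2}(\pi)$ and one for the regular remainder $\eta$ (your $v^{\sharp}$, satisfying $\zeta=-3\zeta\prec\ttwoone{X}+\eta$) in $B^{+\beta}_{2,2}(\pi)$, with the specific choice $\beta=2\kappa$. The two estimates are summed, and the right-hand sides---after duality and the weight-interpolation trick above---are bounded by $\|\zeta\|_{B^{1-\beta}_{2,2}(\pi)}+\|\eta\|_{B^{1+\beta}_{2,2}(\pi)}$, which is exactly the gradient-level quantity available on the left. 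A single-equation approach with ``corrections involving $\vartheta-\tilde\vartheta$'' does not capture this: you need a separate evolution equation for $\eta$ and a separate test at positive regularity, otherwise the term $\llbracket X^{2}\rrbracket\circ\eta$ (which sits in the $\eta$-equation, not the $\zeta$-equation) cannot be controlled.
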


In what follows we present the proof of this result in the more involved setting of $d=3$. The two dimensional case follows the same pattern but is significantly easier and  the details are left  to the reader. Since the cubic term does not seem to be helpful for uniqueness, namely, when we study the equation for a difference of two solutions, it is necessary to find another mechanism which could handle the loss of weight in the terms of lower order. This issue can  be easily seen on the model equation
$$
\LL v = v f, \quad v(0)=0,
$$
which is the form the equation for the difference takes. Intuitively, if $f$ can only be bounded in a weighted space and accordingly also $v$ is bounded in a weighted space, then the product $vf$ can only be bounded when multiplied by the product of the two weights. Hence the right hand side requires higher weight than the left hand side which causes difficulties in closing the estimates.

We overcome this problem by introducing an exponential weight of the form $\rho(x)\pi(t,x):=\langle x \rangle^{- a} e^{- t \langle x
\rangle^b}$ for $a \in \mathbbm{R}$ and $b \in (0, 1)$. As usual $t\in [0,\8)$ denotes the time variable. However, this is not an admissible weight in the sense of Section \ref{ssec:besov} and consequently the definition of the associated weighted Besov spaces requires a different approach, either employing ultra-distributions (see \cite{MR891189}) or Gevrey classes (see \cite{MR1249275}). In Section \ref{ss:b} we recall the basic ideas  based on Gevrey classes following the detailed presentation of \cite{MW17}, where we also refer the reader for further
details. Note that exponential weights have already been employed in \cite{MR3358965, MR3779690}.

With suitable weighted Besov spaces at hand, we employ the classical $L^{2}$-energy technique. First, and similarly to the previous sections, we decompose the equation for the difference of two solutions into its regular and irregular components. Then we test both equations by a suitable test function, which corresponds to  the chain rule for certain Besov norm in the $L^{2}$-scale. This way we obtain a control of the $B_{2,2}^{\beta}(\pi_{t})$-norm of the regular component and the $B_{2,2}^{-\beta}(\pi_{t})$-norm of the irregular one, for some $\beta\in (0,1)$. The advantage of the exponential weight $\pi$ (which depends on time) originates in the form of its time derivative. More precisely, this gives a good term on the left hand side with weight of the form $\pi\rho^{-2b}$, that is, explosive at infinity in the space variable. This term is essential in order to control all the terms on the right hand side.

\subsection{Besov spaces with exponential weights}
\label{ss:b}

For the proof of Theorem \ref{thm:uniq3d}, we will employ weighted Besov
spaces with weights of the form $\rho(x)\pi(t,x):=\langle x \rangle^{- a} e^{- t \langle x
\rangle^b}$ for $a \in \mathbbm{R}$ and $b \in (0, 1)$ and $t\in [0,\8)$, which stands for the time variable. In order to compensate for the exponential growth, the definition of the
corresponding Besov spaces relies on the so-called Gevrey classes rather than
on Schwartz functions. Since multiplication by the polynomial weight $\langle
x \rangle^{- a}$ only introduces a logarithmic correction, namely, $\langle x
\rangle^{- a} e^{- t \langle x \rangle^b} = e^{- t \langle x \rangle^b - a
\log \langle x \rangle}$, we may work with the same Gevrey class
$\mathcal{G}^{\theta}$ of index $\theta \in (1, 1 / b)$ as for the case of only exponential weight $e^{- t \langle x
\rangle^b}$.
Consequently, the  results of \cite[Section~2]{MW17} remain valid and given $T>0$ the corresponding bounds are uniform over all $t\in [0,T]$.

Next,  we define the weighted Besov spaces (based on a partition of
unity from $\mathcal{G}^{\theta}$), as the completion of $C^{\infty}_c$ with
respect to the norm
\[ \| f \|_{B^{\alpha}_{p, q} (\pi_{t}\rho)} \assign \left( \sum_{k \geqslant - 1}
   (2^{\alpha k} \| \Delta_k f \|_{L^p (\pi_{t}\rho)})^q \right)^{\frac{1}{q}} =
   \left( \sum_{k \geqslant - 1} (2^{\alpha k} \| \pi_{t}\rho \Delta_k f \|_{L^p})^q
   \right)^{\frac{1}{q}}, \]
where $\pi_{t}(\cdot)=\pi(t,\cdot)$.
Note that unlike \cite{MW17} we pull the weight inside the $L^p$-norm, which is consistent with our definition of weighted Besov spaces in Section \ref{ssec:besov}. The corresponding results of \cite[Section 3.3]{MW17} (with straightforward modifications due to the weights) remain valid.
More precisely, the following paraproduct estimates will be used in the sequel.

\begin{lemma}
  \label{lem:par1}Let $\kappa \in [0, 1]$, $\beta \in \mathbbm{R}$ and $\delta > 0$. Then it
  holds uniformly in $t\geq 0$
  \[ \| f \prec g \|_{B_{2, 2}^{\beta} (\pi_{t} \rho)} \lesssim \| f \|_{L^2
     (\pi_{t})} \| g \|_{\CC^{\beta + \delta} (\rho)}\wedge \| f
     \|_{L^{\infty} (\rho)} \| g \|_{B^{\beta}_{2, 2} (\pi_{t})}, \]
  and if $\alpha < 0$ then uniformly over $t\geq 0$
    \[ \| f \prec g \|_{B_{2, 2}^{\alpha + \beta} (\pi_{t} \rho)} \lesssim \| f
     \|_{B^{\alpha}_{2, 2} (\pi_{t})} \| g \|_{\CC^{\beta+\delta} (\rho)} \wedge\| f
     \|_{\CC^{\alpha} (\rho)} \| g \|_{B^{\beta}_{2, 2} (\pi_{t})} . \]
  If $\alpha, \beta \in \mathbbm{R}$ such that $\alpha + \beta > 0$ then it
  holds uniformly in $t\geq 0$
  \[ \| f \circ g \|_{B_{2, 2}^{\alpha + \beta} (\pi_{t} \rho^{})} \lesssim \| f
     \|_{\CC^{\alpha} (\rho)} \| g \|_{B^{\beta}_{2, 2} (\pi_{t})} . \]
\end{lemma}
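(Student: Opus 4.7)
My plan is to prove this lemma by combining Bony's paraproduct decomposition with careful Hölder estimates that split the product weight $\pi_t \rho$ between the two factors. The main subtlety is that $\pi_t$ is not admissible in the classical sense, so I cannot directly invoke Fourier multiplier results for weighted spaces; instead I rely on the Gevrey-class framework of \cite{MW17}, which guarantees that (i)~each Littlewood--Paley block $\Delta_k$ acts boundedly on $L^p(\pi_t)$ uniformly in $t\ge 0$ (and similarly on $L^p(\pi_t \rho)$ since $\rho$ is admissible), and (ii)~Bernstein's inequality still holds in $L^p(\pi_t)$ up to uniform constants. All bounds below must be shown uniform in $t\ge 0$, but this is automatic from the MW17 framework once the weight $\pi_t$ is fixed.

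First I would treat the paraproduct $f\prec g=\sum_{j}S_{j-1}f\,\Delta_j g$. The Fourier support of $S_{j-1}f\,\Delta_j g$ lies in an annulus of size $2^j$, so $\Delta_k(f\prec g)=\sum_{|j-k|\le N}\Delta_k(S_{j-1}f\,\Delta_j g)$ for some fixed $N$. Splitting the weight as $\pi_t \rho = \pi_t\cdot\rho$ and using H\"older's inequality together with the boundedness of $\Delta_k$ on $L^2(\pi_t\rho)$ gives
\[
\|\Delta_k(S_{j-1}f\,\Delta_j g)\|_{L^2(\pi_t\rho)}\lesssim \|S_{j-1}f\|_{L^\infty(\rho)}\|\Delta_j g\|_{L^2(\pi_t)}\wedge \|S_{j-1}f\|_{L^2(\pi_t)}\|\Delta_j g\|_{L^\infty(\rho)}.
\]
In the first case I bound $\|S_{j-1}f\|_{L^\infty(\rho)}\lesssim\|f\|_{L^\infty(\rho)}$, multiply by $2^{\beta k}$, and use that $\|\Delta_j g\|_{L^2(\pi_t)}\lesssim 2^{-j\beta}\|g\|_{B^\beta_{2,2}(\pi_t)}$ together with $j\sim k$ and square-summability in $k$. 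In the second case, $\|S_{j-1}f\|_{L^2(\pi_t)}\lesssim\|f\|_{L^2(\pi_t)}$ uniformly and $\|\Delta_j g\|_{L^\infty(\rho)}\lesssim 2^{-j(\beta+\delta)}\|g\|_{\CC^{\beta+\delta}(\rho)}$, so the extra factor $2^{-j\delta}$ compensates the $\ell^2$ summation in $k$ (any $\delta>0$ suffices). This gives the first two bounds.

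For the case $\alpha<0$, I proceed identically but replace $\|S_{j-1}f\|_{L^\infty(\rho)}$ by $\|S_{j-1}f\|_{\CC^\alpha(\rho)}\lesssim 2^{-j\alpha}\|f\|_{\CC^\alpha(\rho)}$ (using $\alpha<0$, so the partial sum grows at most like $2^{-j\alpha}$), respectively $\|S_{j-1}f\|_{L^2(\pi_t)}$ by $\|S_{j-1}f\|_{B^\alpha_{2,2}(\pi_t)}$ controlled similarly. The extra factor $2^{-j\alpha}$ then shifts the regularity on $g$ from $\beta$ to $\alpha+\beta$, and the $\ell^2_k$ summability still follows from $j\sim k$ (combined with the spectral gap $\delta$ in the versions using $\CC^{\beta+\delta}(\rho)$).

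For the resonant product $f\circ g=\sum_{|j-j'|\le 1}\Delta_j f\,\Delta_{j'}g$, each summand has Fourier support in a ball of radius $\lesssim 2^j$, so $\Delta_k(\Delta_j f\,\Delta_{j'}g)$ vanishes unless $j\gtrsim k$. Using H\"older with $L^\infty(\rho)$ and $L^2(\pi_t)$,
\[
\|\Delta_j f\,\Delta_{j'}g\|_{L^2(\pi_t\rho)}\lesssim 2^{-j\alpha}\|f\|_{\CC^\alpha(\rho)}\,2^{-j'\beta}\|\Delta_{j'}g\|_{L^2(\pi_t)}2^{j'\beta},
\]
and then by Lemma~\ref{lem:bcd}-type arguments (transferred to $B^{\alpha+\beta}_{2,2}(\pi_t\rho)$ via the Gevrey partition of unity), summing in $j\ge k$ yields $2^{-k(\alpha+\beta)}$ times $\|f\|_{\CC^\alpha(\rho)}\|g\|_{B^\beta_{2,2}(\pi_t)}$, where convergence of the $j$-sum is exactly ensured by $\alpha+\beta>0$.

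The expected main obstacle is purely bookkeeping: verifying that the Gevrey-class Littlewood--Paley analysis of \cite{MW17} transports cleanly to mixed weights $\pi_t\rho$, and in particular that $\Delta_k$ is bounded on $L^p(\pi_t\rho)$ uniformly in $t$ and $k$. Once this is granted, all three estimates follow by the classical Bony decomposition with only the minor modifications described above; no new structural idea beyond the split $\pi_t\rho=\pi_t\cdot\rho$ plus H\"older is needed.
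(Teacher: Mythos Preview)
Your proposal is correct. The paper's own proof is considerably shorter because it invokes \cite[Theorem~3.17]{MW17} as a black-box weighted paraproduct estimate and then adjusts the indices via the Besov embeddings $L^2(\pi_t)\hookrightarrow B^{-\gamma}_{2,\infty}(\pi_t)$ and $\CC^{\beta+\delta}(\rho)\hookrightarrow B^{\beta+\gamma}_{\infty,2}(\rho)$ for $0<\gamma<\delta$; the remaining cases are said to follow directly from the same theorem. You instead unpack the Bony decomposition by hand and carry out the $\ell^2_k$ summation explicitly, using the spare factor $2^{-j\delta}$ for summability where the paper uses the embedding trick. Both routes rest on the same foundational input from the Gevrey framework of \cite{MW17} (uniform boundedness of $\Delta_k$ on $L^p(\pi_t\rho)$), so the difference is one of presentation rather than substance: the paper's version is terser but less self-contained, yours is more explicit and makes the role of the $\delta$-gap transparent.
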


\begin{proof}
Let  $0<\gamma<\delta$. As a consequence of  \cite[Theorem 3.17]{MW17} and embeddings of Besov spaces, we have
  \[ \| f \prec g \|_{B_{2, 2}^{\beta} (\pi_{t} \rho)} \lesssim \| f \|_{B^{-
     \gamma}_{2, \infty} (\pi_{t})} \| g \|_{B^{\beta+\gamma}_{\infty, 2} (\rho)}
     \lesssim \| f \|_{L^2 (\pi_{t})} \| g \|_{B^{\beta + \delta}_{\infty, \infty}
     (\rho)} . \]
     So the first bound follows and similarly we obtain the third bound.
     The remaining bounds follow directly from  \cite[Theorem 3.17]{MW17}.
\end{proof}

Similarly to Lemma \ref{lem:com} we obtain the following result, whose  proof is a straightforward modification of \cite[Lemma 2.4]{GIP}.
\begin{lemma}
  \label{lem:com1}
  Let $\alpha \in (0, 1)$ and $\beta,
  \gamma \in \mathbbm{R}$ such that $\alpha + \beta + \gamma > 0$ and $\beta +
  \gamma < 0$. Then for every $\delta > 0$ it holds uniformly in $t\ge 0$
  \[ \| \tmop{com} (f, g, h) \|_{B^{\alpha + \beta + \gamma}_{2, 2} (\pi_{t}
     \rho_{1}\rho_{2})} \lesssim \| f \|_{B^{\alpha}_{2, 2} (\pi_{t})} \| g \|_{\CC^{\beta}
     (\rho_{1})} \| h \|_{\CC^{\gamma + \delta} (\rho_{2})} . \]
\end{lemma}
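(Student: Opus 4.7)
The proof will follow the classical pattern of Lemma 2.4 in \cite{GIP} and its admissible-weight version Lemma \ref{lem:com}, with two adaptations: the use of the $L^2$-based Besov norm $B^\alpha_{2,2}(\pi_t)$ on $f$ in place of an $L^\infty$-based norm, and the use of the Gevrey Littlewood--Paley decomposition recalled in Section \ref{ss:b}. The starting point is the standard block-by-block rewriting of $\tmop{com}(f,g,h) = (f\prec g)\circ h - f(g\circ h)$. Writing $(f\prec g)\circ h = \sum_i \Delta_i(f\prec g)\cdot \widetilde\Delta_i h$ and $f(g\circ h) = \sum_i f\,\Delta_i g\cdot \widetilde\Delta_i h$ and using the identity $\Delta_i(f\prec g) = \sum_{k<i-1}\Delta_i(S_{k-1}f\cdot\Delta_k g)$, one obtains after expansion
$$\tmop{com}(f,g,h)(x) = \sum_{i,k}\int K_i(x-y)\bigl[S_{k-1}f(y)-S_{k-1}f(x)\bigr]\Delta_k g(y)\,dy\cdot \widetilde\Delta_i h(x) + R(x),$$
where $R$ collects finitely many harmless additional terms. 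The appearance of the difference $S_{k-1}f(y)-S_{k-1}f(x)$ is the algebraic cancellation responsible for the gain of regularity $\alpha$ from $f$; the support of $K_i$ is concentrated at scale $|y-x|\lesssim 2^{-i}$.

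The estimate of each block in $L^2(\pi_t\rho_1\rho_2)$ will then rest on three ingredients. First, the finite-difference characterization of $B^\alpha_{2,2}(\pi_t)$ together with the translation-quasi-invariance of the Gevrey weight $\pi_t$ at scales $|y-x|\lesssim 2^{-i}$ yields
$$\Bigl\|\int K_i(\cdot-y)\bigl[S_{k-1}f(y)-S_{k-1}f(\cdot)\bigr]\,dy\Bigr\|_{L^2(\pi_t)} \lesssim 2^{-i\alpha}\|f\|_{B^\alpha_{2,2}(\pi_t)}.$$
Second, the weighted block estimates give $\|\Delta_k g\|_{L^\infty(\rho_1)}\lesssim 2^{-k\beta}\|g\|_{\CC^\beta(\rho_1)}$ and $\|\widetilde\Delta_i h\|_{L^\infty(\rho_2)}\lesssim 2^{-i(\gamma+\delta)}\|h\|_{\CC^{\gamma+\delta}(\rho_2)}$. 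Third, pairing these via H\"older's inequality $\|uvw\|_{L^2(\pi_t\rho_1\rho_2)} \leq \|u\|_{L^2(\pi_t)}\|v\|_{L^\infty(\rho_1)}\|w\|_{L^\infty(\rho_2)}$ produces
$$\|\Delta_i \tmop{com}(f,g,h)\|_{L^2(\pi_t\rho_1\rho_2)} \lesssim 2^{-i(\alpha+\beta+\gamma)}\,\|f\|_{B^\alpha_{2,2}(\pi_t)}\|g\|_{\CC^\beta(\rho_1)}\|h\|_{\CC^{\gamma+\delta}(\rho_2)},$$
where the geometric sum over $k<i-1$ (hidden in the passage from the kernel representation to the block estimate above) converges thanks to $\beta+\gamma<0$, and the outer sum required to recover the Besov norm converges because $\alpha+\beta+\gamma>0$.

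The main technical obstacle, and the only place where the present argument truly departs from Lemma \ref{lem:com}, is the imperfect Fourier localization of the Gevrey Littlewood--Paley blocks: the Fourier support of $\Delta_k$ is not annular, so commuting the projectors with multiplication operators produces off-diagonal terms decaying only sub-exponentially in the frequency gap. The small regularity loss $\delta$ on $h$ provides precisely the room to absorb these corrections by a geometric summation, exactly as in the paraproduct estimates of \cite[Section 3.3]{MW17} used to justify Lemma \ref{lem:par1}. Once this bookkeeping is in place, the conclusion follows by dyadic summation and the extension from smooth functions by density yields the stated trilinear bound on $\tmop{com}$.
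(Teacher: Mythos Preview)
Your approach is essentially the same as the paper's: the paper does not give a detailed argument but simply states that the proof is a straightforward modification of \cite[Lemma~2.4]{GIP}, in the spirit of Lemma~\ref{lem:com}, and your sketch carries out precisely that modification (adapting to the $B^{\alpha}_{2,2}(\pi_t)$ norm on $f$ and to the Gevrey Littlewood--Paley blocks via the machinery of \cite[Section~3.3]{MW17}). One small imprecision: the hypothesis $\beta+\gamma<0$ is not what makes the inner sum converge---in the GIP argument the relevant sums are over $|k-i|\lesssim 1$ after using the commutator bound of Lemma~\ref{lem:2.2}, and the condition $\beta+\gamma<0$ serves rather to ensure that $g\circ h$ and hence $f(g\circ h)$ are only defined through the commutator structure---but this does not affect the validity of your outline.
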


\subsection{Proof of Theorem \ref{thm:uniq3d}}

\begin{proof}
We prove the result for the case $d=3$.  Recall that if $\varphi$ is a solution to \eqref{eq:phi4p}  in the sense of  Theorem \ref{thm:ex3d} then $\varphi = X - X^{\!\resizebox{0.6em}{!}{
\begin{tikzpicture}
\pgfpathmoveto{\pgfqpoint{0cm}{-0.035cm}}
\pgfpathlineto{\pgfqpoint{1.376cm}{-0.035cm}}
\pgfpathlineto{\pgfqpoint{1.376cm}{1.552cm}}
\pgfpathlineto{\pgfqpoint{0cm}{1.552cm}}
\pgfpathclose
\pgfusepath{clip}
\begin{pgfscope}
\begin{pgfscope}
\pgfpathmoveto{\pgfqpoint{0cm}{-0.035cm}}
\pgfpathlineto{\pgfqpoint{1.376cm}{-0.035cm}}
\pgfpathlineto{\pgfqpoint{1.376cm}{1.552cm}}
\pgfpathlineto{\pgfqpoint{0cm}{1.552cm}}
\pgfpathclose
\pgfusepath{clip}
\begin{pgfscope}
\begin{pgfscope}
\pgfsetdash{}{0cm}
\pgfsetlinewidth{0.818mm}
\pgfsetroundcap
\pgfsetroundjoin
\pgfsetmiterlimit{7.0}
\definecolor{eps2pgf_color}{gray}{0}\pgfsetstrokecolor{eps2pgf_color}\pgfsetfillcolor{eps2pgf_color}
\pgfpathmoveto{\pgfqpoint{0.117cm}{1.421cm}}
\pgfpathlineto{\pgfqpoint{0.682cm}{0.671cm}}
\pgfpathlineto{\pgfqpoint{1.246cm}{1.421cm}}
\pgfusepath{stroke}
\end{pgfscope}
\definecolor{eps2pgf_color}{gray}{0}\pgfsetstrokecolor{eps2pgf_color}\pgfsetfillcolor{eps2pgf_color}
\pgfpathmoveto{\pgfqpoint{0.273cm}{1.395cm}}
\pgfpathcurveto{\pgfqpoint{0.273cm}{1.432cm}}{\pgfqpoint{0.259cm}{1.467cm}}{\pgfqpoint{0.233cm}{1.492cm}}
\pgfpathcurveto{\pgfqpoint{0.207cm}{1.518cm}}{\pgfqpoint{0.173cm}{1.532cm}}{\pgfqpoint{0.137cm}{1.532cm}}
\pgfpathcurveto{\pgfqpoint{0.1cm}{1.532cm}}{\pgfqpoint{0.066cm}{1.518cm}}{\pgfqpoint{0.04cm}{1.492cm}}
\pgfpathcurveto{\pgfqpoint{0.014cm}{1.467cm}}{\pgfqpoint{0cm}{1.432cm}}{\pgfqpoint{0cm}{1.395cm}}
\pgfpathcurveto{\pgfqpoint{0cm}{1.359cm}}{\pgfqpoint{0.014cm}{1.324cm}}{\pgfqpoint{0.04cm}{1.299cm}}
\pgfpathcurveto{\pgfqpoint{0.066cm}{1.273cm}}{\pgfqpoint{0.1cm}{1.258cm}}{\pgfqpoint{0.137cm}{1.258cm}}
\pgfpathcurveto{\pgfqpoint{0.173cm}{1.258cm}}{\pgfqpoint{0.207cm}{1.273cm}}{\pgfqpoint{0.233cm}{1.299cm}}
\pgfpathcurveto{\pgfqpoint{0.259cm}{1.324cm}}{\pgfqpoint{0.273cm}{1.359cm}}{\pgfqpoint{0.273cm}{1.395cm}}
\pgfusepath{fill}
\begin{pgfscope}
\pgfsetdash{}{0cm}
\pgfsetlinewidth{0.818mm}
\pgfsetmiterlimit{7.0}
\pgfpathmoveto{\pgfqpoint{0.682cm}{0.671cm}}
\pgfpathlineto{\pgfqpoint{0.679cm}{1.418cm}}
\pgfusepath{stroke}
\end{pgfscope}
\pgfpathmoveto{\pgfqpoint{0.815cm}{1.399cm}}
\pgfpathcurveto{\pgfqpoint{0.815cm}{1.435cm}}{\pgfqpoint{0.801cm}{1.47cm}}{\pgfqpoint{0.775cm}{1.496cm}}
\pgfpathcurveto{\pgfqpoint{0.75cm}{1.521cm}}{\pgfqpoint{0.715cm}{1.536cm}}{\pgfqpoint{0.679cm}{1.536cm}}
\pgfpathcurveto{\pgfqpoint{0.643cm}{1.536cm}}{\pgfqpoint{0.608cm}{1.521cm}}{\pgfqpoint{0.582cm}{1.496cm}}
\pgfpathcurveto{\pgfqpoint{0.557cm}{1.47cm}}{\pgfqpoint{0.542cm}{1.435cm}}{\pgfqpoint{0.542cm}{1.399cm}}
\pgfpathcurveto{\pgfqpoint{0.542cm}{1.363cm}}{\pgfqpoint{0.557cm}{1.328cm}}{\pgfqpoint{0.582cm}{1.302cm}}
\pgfpathcurveto{\pgfqpoint{0.608cm}{1.276cm}}{\pgfqpoint{0.643cm}{1.262cm}}{\pgfqpoint{0.679cm}{1.262cm}}
\pgfpathcurveto{\pgfqpoint{0.715cm}{1.262cm}}{\pgfqpoint{0.75cm}{1.276cm}}{\pgfqpoint{0.775cm}{1.302cm}}
\pgfpathcurveto{\pgfqpoint{0.801cm}{1.328cm}}{\pgfqpoint{0.815cm}{1.363cm}}{\pgfqpoint{0.815cm}{1.399cm}}
\pgfusepath{fill}
\pgfpathmoveto{\pgfqpoint{1.345cm}{1.371cm}}
\pgfpathcurveto{\pgfqpoint{1.345cm}{1.408cm}}{\pgfqpoint{1.331cm}{1.442cm}}{\pgfqpoint{1.305cm}{1.468cm}}
\pgfpathcurveto{\pgfqpoint{1.28cm}{1.494cm}}{\pgfqpoint{1.245cm}{1.508cm}}{\pgfqpoint{1.209cm}{1.508cm}}
\pgfpathcurveto{\pgfqpoint{1.172cm}{1.508cm}}{\pgfqpoint{1.138cm}{1.494cm}}{\pgfqpoint{1.112cm}{1.468cm}}
\pgfpathcurveto{\pgfqpoint{1.087cm}{1.442cm}}{\pgfqpoint{1.072cm}{1.408cm}}{\pgfqpoint{1.072cm}{1.371cm}}
\pgfpathcurveto{\pgfqpoint{1.072cm}{1.335cm}}{\pgfqpoint{1.087cm}{1.3cm}}{\pgfqpoint{1.112cm}{1.274cm}}
\pgfpathcurveto{\pgfqpoint{1.138cm}{1.249cm}}{\pgfqpoint{1.172cm}{1.234cm}}{\pgfqpoint{1.209cm}{1.234cm}}
\pgfpathcurveto{\pgfqpoint{1.245cm}{1.234cm}}{\pgfqpoint{1.28cm}{1.249cm}}{\pgfqpoint{1.305cm}{1.274cm}}
\pgfpathcurveto{\pgfqpoint{1.331cm}{1.3cm}}{\pgfqpoint{1.345cm}{1.335cm}}{\pgfqpoint{1.345cm}{1.371cm}}
\pgfusepath{fill}
\begin{pgfscope}
\pgfsetdash{}{0cm}
\pgfsetlinewidth{0.818mm}
\pgfsetroundcap
\pgfsetmiterlimit{4.0}
\pgfpathmoveto{\pgfqpoint{0.682cm}{0.671cm}}
\pgfpathlineto{\pgfqpoint{0.682cm}{0.042cm}}
\pgfusepath{stroke}
\end{pgfscope}
\end{pgfscope}
\end{pgfscope}
\end{pgfscope}
\end{tikzpicture}}} + \phi + \psi$, where $\phi$ is paracontrolled by $X^{\!\resizebox{0.6em}{!}{
\begin{tikzpicture}
\pgfpathmoveto{\pgfqpoint{0cm}{0cm}}
\pgfpathlineto{\pgfqpoint{1.376cm}{0cm}}
\pgfpathlineto{\pgfqpoint{1.376cm}{1.588cm}}
\pgfpathlineto{\pgfqpoint{0cm}{1.588cm}}
\pgfpathclose
\pgfusepath{clip}
\begin{pgfscope}
\begin{pgfscope}
\pgfpathmoveto{\pgfqpoint{0cm}{0cm}}
\pgfpathlineto{\pgfqpoint{1.376cm}{0cm}}
\pgfpathlineto{\pgfqpoint{1.376cm}{1.588cm}}
\pgfpathlineto{\pgfqpoint{0cm}{1.588cm}}
\pgfpathclose
\pgfusepath{clip}
\begin{pgfscope}
\begin{pgfscope}
\definecolor{eps2pgf_color}{gray}{0.976471}\pgfsetstrokecolor{eps2pgf_color}\pgfsetfillcolor{eps2pgf_color}
\pgfpathmoveto{\pgfqpoint{0cm}{0cm}}
\pgfpathlineto{\pgfqpoint{1.376cm}{0cm}}
\pgfpathlineto{\pgfqpoint{1.376cm}{1.588cm}}
\pgfpathlineto{\pgfqpoint{0cm}{1.588cm}}
\pgfpathclose
\pgfusepath{fill}
\end{pgfscope}
\begin{pgfscope}
\pgfsetdash{}{0cm}
\pgfsetlinewidth{0.818mm}
\pgfsetroundcap
\pgfsetroundjoin
\pgfsetmiterlimit{7.0}
\definecolor{eps2pgf_color}{gray}{0}\pgfsetstrokecolor{eps2pgf_color}\pgfsetfillcolor{eps2pgf_color}
\pgfpathmoveto{\pgfqpoint{0.117cm}{1.476cm}}
\pgfpathlineto{\pgfqpoint{0.682cm}{0.726cm}}
\pgfpathlineto{\pgfqpoint{1.246cm}{1.476cm}}
\pgfusepath{stroke}
\end{pgfscope}
\definecolor{eps2pgf_color}{gray}{0}\pgfsetstrokecolor{eps2pgf_color}\pgfsetfillcolor{eps2pgf_color}
\pgfpathmoveto{\pgfqpoint{0.273cm}{1.451cm}}
\pgfpathcurveto{\pgfqpoint{0.273cm}{1.487cm}}{\pgfqpoint{0.259cm}{1.522cm}}{\pgfqpoint{0.233cm}{1.547cm}}
\pgfpathcurveto{\pgfqpoint{0.207cm}{1.573cm}}{\pgfqpoint{0.173cm}{1.588cm}}{\pgfqpoint{0.137cm}{1.588cm}}
\pgfpathcurveto{\pgfqpoint{0.1cm}{1.588cm}}{\pgfqpoint{0.066cm}{1.573cm}}{\pgfqpoint{0.04cm}{1.547cm}}
\pgfpathcurveto{\pgfqpoint{0.014cm}{1.522cm}}{\pgfqpoint{0cm}{1.487cm}}{\pgfqpoint{0cm}{1.451cm}}
\pgfpathcurveto{\pgfqpoint{0cm}{1.414cm}}{\pgfqpoint{0.014cm}{1.379cm}}{\pgfqpoint{0.04cm}{1.354cm}}
\pgfpathcurveto{\pgfqpoint{0.066cm}{1.328cm}}{\pgfqpoint{0.1cm}{1.314cm}}{\pgfqpoint{0.137cm}{1.314cm}}
\pgfpathcurveto{\pgfqpoint{0.173cm}{1.314cm}}{\pgfqpoint{0.207cm}{1.328cm}}{\pgfqpoint{0.233cm}{1.354cm}}
\pgfpathcurveto{\pgfqpoint{0.259cm}{1.379cm}}{\pgfqpoint{0.273cm}{1.414cm}}{\pgfqpoint{0.273cm}{1.451cm}}
\pgfusepath{fill}
\pgfpathmoveto{\pgfqpoint{1.345cm}{1.426cm}}
\pgfpathcurveto{\pgfqpoint{1.345cm}{1.463cm}}{\pgfqpoint{1.331cm}{1.497cm}}{\pgfqpoint{1.305cm}{1.523cm}}
\pgfpathcurveto{\pgfqpoint{1.28cm}{1.549cm}}{\pgfqpoint{1.245cm}{1.563cm}}{\pgfqpoint{1.209cm}{1.563cm}}
\pgfpathcurveto{\pgfqpoint{1.172cm}{1.563cm}}{\pgfqpoint{1.138cm}{1.549cm}}{\pgfqpoint{1.112cm}{1.523cm}}
\pgfpathcurveto{\pgfqpoint{1.087cm}{1.497cm}}{\pgfqpoint{1.072cm}{1.463cm}}{\pgfqpoint{1.072cm}{1.426cm}}
\pgfpathcurveto{\pgfqpoint{1.072cm}{1.39cm}}{\pgfqpoint{1.087cm}{1.355cm}}{\pgfqpoint{1.112cm}{1.329cm}}
\pgfpathcurveto{\pgfqpoint{1.138cm}{1.304cm}}{\pgfqpoint{1.172cm}{1.289cm}}{\pgfqpoint{1.209cm}{1.289cm}}
\pgfpathcurveto{\pgfqpoint{1.245cm}{1.289cm}}{\pgfqpoint{1.28cm}{1.304cm}}{\pgfqpoint{1.305cm}{1.329cm}}
\pgfpathcurveto{\pgfqpoint{1.331cm}{1.355cm}}{\pgfqpoint{1.345cm}{1.39cm}}{\pgfqpoint{1.345cm}{1.426cm}}
\pgfusepath{fill}
\begin{pgfscope}
\pgfsetdash{}{0cm}
\pgfsetlinewidth{0.818mm}
\pgfsetroundcap
\pgfsetmiterlimit{4.0}
\pgfpathmoveto{\pgfqpoint{0.682cm}{0.726cm}}
\pgfpathlineto{\pgfqpoint{0.682cm}{0.097cm}}
\pgfusepath{stroke}
\end{pgfscope}
\end{pgfscope}
\end{pgfscope}
\end{pgfscope}
\end{tikzpicture}}}$
 and equations \eqref{eq:two43}, \eqref{eq:th43} are satisfied.
For the purposes of the proof of uniqueness, it is not necessary to consider the modified paraproduct and therefore we may work with a similar decomposition as in the elliptic setting in Section \ref{sec:45}.
We define
  \[ \phi = \theta - 3 (- X^{\!\resizebox{0.6em}{!}{
\begin{tikzpicture}
\pgfpathmoveto{\pgfqpoint{0cm}{-0.035cm}}
\pgfpathlineto{\pgfqpoint{1.376cm}{-0.035cm}}
\pgfpathlineto{\pgfqpoint{1.376cm}{1.552cm}}
\pgfpathlineto{\pgfqpoint{0cm}{1.552cm}}
\pgfpathclose
\pgfusepath{clip}
\begin{pgfscope}
\begin{pgfscope}
\pgfpathmoveto{\pgfqpoint{0cm}{-0.035cm}}
\pgfpathlineto{\pgfqpoint{1.376cm}{-0.035cm}}
\pgfpathlineto{\pgfqpoint{1.376cm}{1.552cm}}
\pgfpathlineto{\pgfqpoint{0cm}{1.552cm}}
\pgfpathclose
\pgfusepath{clip}
\begin{pgfscope}
\begin{pgfscope}
\pgfsetdash{}{0cm}
\pgfsetlinewidth{0.818mm}
\pgfsetroundcap
\pgfsetroundjoin
\pgfsetmiterlimit{7.0}
\definecolor{eps2pgf_color}{gray}{0}\pgfsetstrokecolor{eps2pgf_color}\pgfsetfillcolor{eps2pgf_color}
\pgfpathmoveto{\pgfqpoint{0.117cm}{1.421cm}}
\pgfpathlineto{\pgfqpoint{0.682cm}{0.671cm}}
\pgfpathlineto{\pgfqpoint{1.246cm}{1.421cm}}
\pgfusepath{stroke}
\end{pgfscope}
\definecolor{eps2pgf_color}{gray}{0}\pgfsetstrokecolor{eps2pgf_color}\pgfsetfillcolor{eps2pgf_color}
\pgfpathmoveto{\pgfqpoint{0.273cm}{1.395cm}}
\pgfpathcurveto{\pgfqpoint{0.273cm}{1.432cm}}{\pgfqpoint{0.259cm}{1.467cm}}{\pgfqpoint{0.233cm}{1.492cm}}
\pgfpathcurveto{\pgfqpoint{0.207cm}{1.518cm}}{\pgfqpoint{0.173cm}{1.532cm}}{\pgfqpoint{0.137cm}{1.532cm}}
\pgfpathcurveto{\pgfqpoint{0.1cm}{1.532cm}}{\pgfqpoint{0.066cm}{1.518cm}}{\pgfqpoint{0.04cm}{1.492cm}}
\pgfpathcurveto{\pgfqpoint{0.014cm}{1.467cm}}{\pgfqpoint{0cm}{1.432cm}}{\pgfqpoint{0cm}{1.395cm}}
\pgfpathcurveto{\pgfqpoint{0cm}{1.359cm}}{\pgfqpoint{0.014cm}{1.324cm}}{\pgfqpoint{0.04cm}{1.299cm}}
\pgfpathcurveto{\pgfqpoint{0.066cm}{1.273cm}}{\pgfqpoint{0.1cm}{1.258cm}}{\pgfqpoint{0.137cm}{1.258cm}}
\pgfpathcurveto{\pgfqpoint{0.173cm}{1.258cm}}{\pgfqpoint{0.207cm}{1.273cm}}{\pgfqpoint{0.233cm}{1.299cm}}
\pgfpathcurveto{\pgfqpoint{0.259cm}{1.324cm}}{\pgfqpoint{0.273cm}{1.359cm}}{\pgfqpoint{0.273cm}{1.395cm}}
\pgfusepath{fill}
\begin{pgfscope}
\pgfsetdash{}{0cm}
\pgfsetlinewidth{0.818mm}
\pgfsetmiterlimit{7.0}
\pgfpathmoveto{\pgfqpoint{0.682cm}{0.671cm}}
\pgfpathlineto{\pgfqpoint{0.679cm}{1.418cm}}
\pgfusepath{stroke}
\end{pgfscope}
\pgfpathmoveto{\pgfqpoint{0.815cm}{1.399cm}}
\pgfpathcurveto{\pgfqpoint{0.815cm}{1.435cm}}{\pgfqpoint{0.801cm}{1.47cm}}{\pgfqpoint{0.775cm}{1.496cm}}
\pgfpathcurveto{\pgfqpoint{0.75cm}{1.521cm}}{\pgfqpoint{0.715cm}{1.536cm}}{\pgfqpoint{0.679cm}{1.536cm}}
\pgfpathcurveto{\pgfqpoint{0.643cm}{1.536cm}}{\pgfqpoint{0.608cm}{1.521cm}}{\pgfqpoint{0.582cm}{1.496cm}}
\pgfpathcurveto{\pgfqpoint{0.557cm}{1.47cm}}{\pgfqpoint{0.542cm}{1.435cm}}{\pgfqpoint{0.542cm}{1.399cm}}
\pgfpathcurveto{\pgfqpoint{0.542cm}{1.363cm}}{\pgfqpoint{0.557cm}{1.328cm}}{\pgfqpoint{0.582cm}{1.302cm}}
\pgfpathcurveto{\pgfqpoint{0.608cm}{1.276cm}}{\pgfqpoint{0.643cm}{1.262cm}}{\pgfqpoint{0.679cm}{1.262cm}}
\pgfpathcurveto{\pgfqpoint{0.715cm}{1.262cm}}{\pgfqpoint{0.75cm}{1.276cm}}{\pgfqpoint{0.775cm}{1.302cm}}
\pgfpathcurveto{\pgfqpoint{0.801cm}{1.328cm}}{\pgfqpoint{0.815cm}{1.363cm}}{\pgfqpoint{0.815cm}{1.399cm}}
\pgfusepath{fill}
\pgfpathmoveto{\pgfqpoint{1.345cm}{1.371cm}}
\pgfpathcurveto{\pgfqpoint{1.345cm}{1.408cm}}{\pgfqpoint{1.331cm}{1.442cm}}{\pgfqpoint{1.305cm}{1.468cm}}
\pgfpathcurveto{\pgfqpoint{1.28cm}{1.494cm}}{\pgfqpoint{1.245cm}{1.508cm}}{\pgfqpoint{1.209cm}{1.508cm}}
\pgfpathcurveto{\pgfqpoint{1.172cm}{1.508cm}}{\pgfqpoint{1.138cm}{1.494cm}}{\pgfqpoint{1.112cm}{1.468cm}}
\pgfpathcurveto{\pgfqpoint{1.087cm}{1.442cm}}{\pgfqpoint{1.072cm}{1.408cm}}{\pgfqpoint{1.072cm}{1.371cm}}
\pgfpathcurveto{\pgfqpoint{1.072cm}{1.335cm}}{\pgfqpoint{1.087cm}{1.3cm}}{\pgfqpoint{1.112cm}{1.274cm}}
\pgfpathcurveto{\pgfqpoint{1.138cm}{1.249cm}}{\pgfqpoint{1.172cm}{1.234cm}}{\pgfqpoint{1.209cm}{1.234cm}}
\pgfpathcurveto{\pgfqpoint{1.245cm}{1.234cm}}{\pgfqpoint{1.28cm}{1.249cm}}{\pgfqpoint{1.305cm}{1.274cm}}
\pgfpathcurveto{\pgfqpoint{1.331cm}{1.3cm}}{\pgfqpoint{1.345cm}{1.335cm}}{\pgfqpoint{1.345cm}{1.371cm}}
\pgfusepath{fill}
\begin{pgfscope}
\pgfsetdash{}{0cm}
\pgfsetlinewidth{0.818mm}
\pgfsetroundcap
\pgfsetmiterlimit{4.0}
\pgfpathmoveto{\pgfqpoint{0.682cm}{0.671cm}}
\pgfpathlineto{\pgfqpoint{0.682cm}{0.042cm}}
\pgfusepath{stroke}
\end{pgfscope}
\end{pgfscope}
\end{pgfscope}
\end{pgfscope}
\end{tikzpicture}}} + \phi + \psi) \prec
     X^{\!\resizebox{0.6em}{!}{
\begin{tikzpicture}
\pgfpathmoveto{\pgfqpoint{0cm}{0cm}}
\pgfpathlineto{\pgfqpoint{1.376cm}{0cm}}
\pgfpathlineto{\pgfqpoint{1.376cm}{1.588cm}}
\pgfpathlineto{\pgfqpoint{0cm}{1.588cm}}
\pgfpathclose
\pgfusepath{clip}
\begin{pgfscope}
\begin{pgfscope}
\pgfpathmoveto{\pgfqpoint{0cm}{0cm}}
\pgfpathlineto{\pgfqpoint{1.376cm}{0cm}}
\pgfpathlineto{\pgfqpoint{1.376cm}{1.588cm}}
\pgfpathlineto{\pgfqpoint{0cm}{1.588cm}}
\pgfpathclose
\pgfusepath{clip}
\begin{pgfscope}
\begin{pgfscope}
\definecolor{eps2pgf_color}{gray}{0.976471}\pgfsetstrokecolor{eps2pgf_color}\pgfsetfillcolor{eps2pgf_color}
\pgfpathmoveto{\pgfqpoint{0cm}{0cm}}
\pgfpathlineto{\pgfqpoint{1.376cm}{0cm}}
\pgfpathlineto{\pgfqpoint{1.376cm}{1.588cm}}
\pgfpathlineto{\pgfqpoint{0cm}{1.588cm}}
\pgfpathclose
\pgfusepath{fill}
\end{pgfscope}
\begin{pgfscope}
\pgfsetdash{}{0cm}
\pgfsetlinewidth{0.818mm}
\pgfsetroundcap
\pgfsetroundjoin
\pgfsetmiterlimit{7.0}
\definecolor{eps2pgf_color}{gray}{0}\pgfsetstrokecolor{eps2pgf_color}\pgfsetfillcolor{eps2pgf_color}
\pgfpathmoveto{\pgfqpoint{0.117cm}{1.476cm}}
\pgfpathlineto{\pgfqpoint{0.682cm}{0.726cm}}
\pgfpathlineto{\pgfqpoint{1.246cm}{1.476cm}}
\pgfusepath{stroke}
\end{pgfscope}
\definecolor{eps2pgf_color}{gray}{0}\pgfsetstrokecolor{eps2pgf_color}\pgfsetfillcolor{eps2pgf_color}
\pgfpathmoveto{\pgfqpoint{0.273cm}{1.451cm}}
\pgfpathcurveto{\pgfqpoint{0.273cm}{1.487cm}}{\pgfqpoint{0.259cm}{1.522cm}}{\pgfqpoint{0.233cm}{1.547cm}}
\pgfpathcurveto{\pgfqpoint{0.207cm}{1.573cm}}{\pgfqpoint{0.173cm}{1.588cm}}{\pgfqpoint{0.137cm}{1.588cm}}
\pgfpathcurveto{\pgfqpoint{0.1cm}{1.588cm}}{\pgfqpoint{0.066cm}{1.573cm}}{\pgfqpoint{0.04cm}{1.547cm}}
\pgfpathcurveto{\pgfqpoint{0.014cm}{1.522cm}}{\pgfqpoint{0cm}{1.487cm}}{\pgfqpoint{0cm}{1.451cm}}
\pgfpathcurveto{\pgfqpoint{0cm}{1.414cm}}{\pgfqpoint{0.014cm}{1.379cm}}{\pgfqpoint{0.04cm}{1.354cm}}
\pgfpathcurveto{\pgfqpoint{0.066cm}{1.328cm}}{\pgfqpoint{0.1cm}{1.314cm}}{\pgfqpoint{0.137cm}{1.314cm}}
\pgfpathcurveto{\pgfqpoint{0.173cm}{1.314cm}}{\pgfqpoint{0.207cm}{1.328cm}}{\pgfqpoint{0.233cm}{1.354cm}}
\pgfpathcurveto{\pgfqpoint{0.259cm}{1.379cm}}{\pgfqpoint{0.273cm}{1.414cm}}{\pgfqpoint{0.273cm}{1.451cm}}
\pgfusepath{fill}
\pgfpathmoveto{\pgfqpoint{1.345cm}{1.426cm}}
\pgfpathcurveto{\pgfqpoint{1.345cm}{1.463cm}}{\pgfqpoint{1.331cm}{1.497cm}}{\pgfqpoint{1.305cm}{1.523cm}}
\pgfpathcurveto{\pgfqpoint{1.28cm}{1.549cm}}{\pgfqpoint{1.245cm}{1.563cm}}{\pgfqpoint{1.209cm}{1.563cm}}
\pgfpathcurveto{\pgfqpoint{1.172cm}{1.563cm}}{\pgfqpoint{1.138cm}{1.549cm}}{\pgfqpoint{1.112cm}{1.523cm}}
\pgfpathcurveto{\pgfqpoint{1.087cm}{1.497cm}}{\pgfqpoint{1.072cm}{1.463cm}}{\pgfqpoint{1.072cm}{1.426cm}}
\pgfpathcurveto{\pgfqpoint{1.072cm}{1.39cm}}{\pgfqpoint{1.087cm}{1.355cm}}{\pgfqpoint{1.112cm}{1.329cm}}
\pgfpathcurveto{\pgfqpoint{1.138cm}{1.304cm}}{\pgfqpoint{1.172cm}{1.289cm}}{\pgfqpoint{1.209cm}{1.289cm}}
\pgfpathcurveto{\pgfqpoint{1.245cm}{1.289cm}}{\pgfqpoint{1.28cm}{1.304cm}}{\pgfqpoint{1.305cm}{1.329cm}}
\pgfpathcurveto{\pgfqpoint{1.331cm}{1.355cm}}{\pgfqpoint{1.345cm}{1.39cm}}{\pgfqpoint{1.345cm}{1.426cm}}
\pgfusepath{fill}
\begin{pgfscope}
\pgfsetdash{}{0cm}
\pgfsetlinewidth{0.818mm}
\pgfsetroundcap
\pgfsetmiterlimit{4.0}
\pgfpathmoveto{\pgfqpoint{0.682cm}{0.726cm}}
\pgfpathlineto{\pgfqpoint{0.682cm}{0.097cm}}
\pgfusepath{stroke}
\end{pgfscope}
\end{pgfscope}
\end{pgfscope}
\end{pgfscope}
\end{tikzpicture}}} \]
  where (formally)
  \begin{align*}
      0 & = \LL \theta + \LL \psi  + 3 \llbracket X^2 \rrbracket \preccurlyeq (- X^{\!\resizebox{0.6em}{!}{
\begin{tikzpicture}
\pgfpathmoveto{\pgfqpoint{0cm}{-0.035cm}}
\pgfpathlineto{\pgfqpoint{1.376cm}{-0.035cm}}
\pgfpathlineto{\pgfqpoint{1.376cm}{1.552cm}}
\pgfpathlineto{\pgfqpoint{0cm}{1.552cm}}
\pgfpathclose
\pgfusepath{clip}
\begin{pgfscope}
\begin{pgfscope}
\pgfpathmoveto{\pgfqpoint{0cm}{-0.035cm}}
\pgfpathlineto{\pgfqpoint{1.376cm}{-0.035cm}}
\pgfpathlineto{\pgfqpoint{1.376cm}{1.552cm}}
\pgfpathlineto{\pgfqpoint{0cm}{1.552cm}}
\pgfpathclose
\pgfusepath{clip}
\begin{pgfscope}
\begin{pgfscope}
\pgfsetdash{}{0cm}
\pgfsetlinewidth{0.818mm}
\pgfsetroundcap
\pgfsetroundjoin
\pgfsetmiterlimit{7.0}
\definecolor{eps2pgf_color}{gray}{0}\pgfsetstrokecolor{eps2pgf_color}\pgfsetfillcolor{eps2pgf_color}
\pgfpathmoveto{\pgfqpoint{0.117cm}{1.421cm}}
\pgfpathlineto{\pgfqpoint{0.682cm}{0.671cm}}
\pgfpathlineto{\pgfqpoint{1.246cm}{1.421cm}}
\pgfusepath{stroke}
\end{pgfscope}
\definecolor{eps2pgf_color}{gray}{0}\pgfsetstrokecolor{eps2pgf_color}\pgfsetfillcolor{eps2pgf_color}
\pgfpathmoveto{\pgfqpoint{0.273cm}{1.395cm}}
\pgfpathcurveto{\pgfqpoint{0.273cm}{1.432cm}}{\pgfqpoint{0.259cm}{1.467cm}}{\pgfqpoint{0.233cm}{1.492cm}}
\pgfpathcurveto{\pgfqpoint{0.207cm}{1.518cm}}{\pgfqpoint{0.173cm}{1.532cm}}{\pgfqpoint{0.137cm}{1.532cm}}
\pgfpathcurveto{\pgfqpoint{0.1cm}{1.532cm}}{\pgfqpoint{0.066cm}{1.518cm}}{\pgfqpoint{0.04cm}{1.492cm}}
\pgfpathcurveto{\pgfqpoint{0.014cm}{1.467cm}}{\pgfqpoint{0cm}{1.432cm}}{\pgfqpoint{0cm}{1.395cm}}
\pgfpathcurveto{\pgfqpoint{0cm}{1.359cm}}{\pgfqpoint{0.014cm}{1.324cm}}{\pgfqpoint{0.04cm}{1.299cm}}
\pgfpathcurveto{\pgfqpoint{0.066cm}{1.273cm}}{\pgfqpoint{0.1cm}{1.258cm}}{\pgfqpoint{0.137cm}{1.258cm}}
\pgfpathcurveto{\pgfqpoint{0.173cm}{1.258cm}}{\pgfqpoint{0.207cm}{1.273cm}}{\pgfqpoint{0.233cm}{1.299cm}}
\pgfpathcurveto{\pgfqpoint{0.259cm}{1.324cm}}{\pgfqpoint{0.273cm}{1.359cm}}{\pgfqpoint{0.273cm}{1.395cm}}
\pgfusepath{fill}
\begin{pgfscope}
\pgfsetdash{}{0cm}
\pgfsetlinewidth{0.818mm}
\pgfsetmiterlimit{7.0}
\pgfpathmoveto{\pgfqpoint{0.682cm}{0.671cm}}
\pgfpathlineto{\pgfqpoint{0.679cm}{1.418cm}}
\pgfusepath{stroke}
\end{pgfscope}
\pgfpathmoveto{\pgfqpoint{0.815cm}{1.399cm}}
\pgfpathcurveto{\pgfqpoint{0.815cm}{1.435cm}}{\pgfqpoint{0.801cm}{1.47cm}}{\pgfqpoint{0.775cm}{1.496cm}}
\pgfpathcurveto{\pgfqpoint{0.75cm}{1.521cm}}{\pgfqpoint{0.715cm}{1.536cm}}{\pgfqpoint{0.679cm}{1.536cm}}
\pgfpathcurveto{\pgfqpoint{0.643cm}{1.536cm}}{\pgfqpoint{0.608cm}{1.521cm}}{\pgfqpoint{0.582cm}{1.496cm}}
\pgfpathcurveto{\pgfqpoint{0.557cm}{1.47cm}}{\pgfqpoint{0.542cm}{1.435cm}}{\pgfqpoint{0.542cm}{1.399cm}}
\pgfpathcurveto{\pgfqpoint{0.542cm}{1.363cm}}{\pgfqpoint{0.557cm}{1.328cm}}{\pgfqpoint{0.582cm}{1.302cm}}
\pgfpathcurveto{\pgfqpoint{0.608cm}{1.276cm}}{\pgfqpoint{0.643cm}{1.262cm}}{\pgfqpoint{0.679cm}{1.262cm}}
\pgfpathcurveto{\pgfqpoint{0.715cm}{1.262cm}}{\pgfqpoint{0.75cm}{1.276cm}}{\pgfqpoint{0.775cm}{1.302cm}}
\pgfpathcurveto{\pgfqpoint{0.801cm}{1.328cm}}{\pgfqpoint{0.815cm}{1.363cm}}{\pgfqpoint{0.815cm}{1.399cm}}
\pgfusepath{fill}
\pgfpathmoveto{\pgfqpoint{1.345cm}{1.371cm}}
\pgfpathcurveto{\pgfqpoint{1.345cm}{1.408cm}}{\pgfqpoint{1.331cm}{1.442cm}}{\pgfqpoint{1.305cm}{1.468cm}}
\pgfpathcurveto{\pgfqpoint{1.28cm}{1.494cm}}{\pgfqpoint{1.245cm}{1.508cm}}{\pgfqpoint{1.209cm}{1.508cm}}
\pgfpathcurveto{\pgfqpoint{1.172cm}{1.508cm}}{\pgfqpoint{1.138cm}{1.494cm}}{\pgfqpoint{1.112cm}{1.468cm}}
\pgfpathcurveto{\pgfqpoint{1.087cm}{1.442cm}}{\pgfqpoint{1.072cm}{1.408cm}}{\pgfqpoint{1.072cm}{1.371cm}}
\pgfpathcurveto{\pgfqpoint{1.072cm}{1.335cm}}{\pgfqpoint{1.087cm}{1.3cm}}{\pgfqpoint{1.112cm}{1.274cm}}
\pgfpathcurveto{\pgfqpoint{1.138cm}{1.249cm}}{\pgfqpoint{1.172cm}{1.234cm}}{\pgfqpoint{1.209cm}{1.234cm}}
\pgfpathcurveto{\pgfqpoint{1.245cm}{1.234cm}}{\pgfqpoint{1.28cm}{1.249cm}}{\pgfqpoint{1.305cm}{1.274cm}}
\pgfpathcurveto{\pgfqpoint{1.331cm}{1.3cm}}{\pgfqpoint{1.345cm}{1.335cm}}{\pgfqpoint{1.345cm}{1.371cm}}
\pgfusepath{fill}
\begin{pgfscope}
\pgfsetdash{}{0cm}
\pgfsetlinewidth{0.818mm}
\pgfsetroundcap
\pgfsetmiterlimit{4.0}
\pgfpathmoveto{\pgfqpoint{0.682cm}{0.671cm}}
\pgfpathlineto{\pgfqpoint{0.682cm}{0.042cm}}
\pgfusepath{stroke}
\end{pgfscope}
\end{pgfscope}
\end{pgfscope}
\end{pgfscope}
\end{tikzpicture}}} +
      \phi + \psi) - 3 [\LL, (- X^{\!\resizebox{0.6em}{!}{
\begin{tikzpicture}
\pgfpathmoveto{\pgfqpoint{0cm}{-0.035cm}}
\pgfpathlineto{\pgfqpoint{1.376cm}{-0.035cm}}
\pgfpathlineto{\pgfqpoint{1.376cm}{1.552cm}}
\pgfpathlineto{\pgfqpoint{0cm}{1.552cm}}
\pgfpathclose
\pgfusepath{clip}
\begin{pgfscope}
\begin{pgfscope}
\pgfpathmoveto{\pgfqpoint{0cm}{-0.035cm}}
\pgfpathlineto{\pgfqpoint{1.376cm}{-0.035cm}}
\pgfpathlineto{\pgfqpoint{1.376cm}{1.552cm}}
\pgfpathlineto{\pgfqpoint{0cm}{1.552cm}}
\pgfpathclose
\pgfusepath{clip}
\begin{pgfscope}
\begin{pgfscope}
\pgfsetdash{}{0cm}
\pgfsetlinewidth{0.818mm}
\pgfsetroundcap
\pgfsetroundjoin
\pgfsetmiterlimit{7.0}
\definecolor{eps2pgf_color}{gray}{0}\pgfsetstrokecolor{eps2pgf_color}\pgfsetfillcolor{eps2pgf_color}
\pgfpathmoveto{\pgfqpoint{0.117cm}{1.421cm}}
\pgfpathlineto{\pgfqpoint{0.682cm}{0.671cm}}
\pgfpathlineto{\pgfqpoint{1.246cm}{1.421cm}}
\pgfusepath{stroke}
\end{pgfscope}
\definecolor{eps2pgf_color}{gray}{0}\pgfsetstrokecolor{eps2pgf_color}\pgfsetfillcolor{eps2pgf_color}
\pgfpathmoveto{\pgfqpoint{0.273cm}{1.395cm}}
\pgfpathcurveto{\pgfqpoint{0.273cm}{1.432cm}}{\pgfqpoint{0.259cm}{1.467cm}}{\pgfqpoint{0.233cm}{1.492cm}}
\pgfpathcurveto{\pgfqpoint{0.207cm}{1.518cm}}{\pgfqpoint{0.173cm}{1.532cm}}{\pgfqpoint{0.137cm}{1.532cm}}
\pgfpathcurveto{\pgfqpoint{0.1cm}{1.532cm}}{\pgfqpoint{0.066cm}{1.518cm}}{\pgfqpoint{0.04cm}{1.492cm}}
\pgfpathcurveto{\pgfqpoint{0.014cm}{1.467cm}}{\pgfqpoint{0cm}{1.432cm}}{\pgfqpoint{0cm}{1.395cm}}
\pgfpathcurveto{\pgfqpoint{0cm}{1.359cm}}{\pgfqpoint{0.014cm}{1.324cm}}{\pgfqpoint{0.04cm}{1.299cm}}
\pgfpathcurveto{\pgfqpoint{0.066cm}{1.273cm}}{\pgfqpoint{0.1cm}{1.258cm}}{\pgfqpoint{0.137cm}{1.258cm}}
\pgfpathcurveto{\pgfqpoint{0.173cm}{1.258cm}}{\pgfqpoint{0.207cm}{1.273cm}}{\pgfqpoint{0.233cm}{1.299cm}}
\pgfpathcurveto{\pgfqpoint{0.259cm}{1.324cm}}{\pgfqpoint{0.273cm}{1.359cm}}{\pgfqpoint{0.273cm}{1.395cm}}
\pgfusepath{fill}
\begin{pgfscope}
\pgfsetdash{}{0cm}
\pgfsetlinewidth{0.818mm}
\pgfsetmiterlimit{7.0}
\pgfpathmoveto{\pgfqpoint{0.682cm}{0.671cm}}
\pgfpathlineto{\pgfqpoint{0.679cm}{1.418cm}}
\pgfusepath{stroke}
\end{pgfscope}
\pgfpathmoveto{\pgfqpoint{0.815cm}{1.399cm}}
\pgfpathcurveto{\pgfqpoint{0.815cm}{1.435cm}}{\pgfqpoint{0.801cm}{1.47cm}}{\pgfqpoint{0.775cm}{1.496cm}}
\pgfpathcurveto{\pgfqpoint{0.75cm}{1.521cm}}{\pgfqpoint{0.715cm}{1.536cm}}{\pgfqpoint{0.679cm}{1.536cm}}
\pgfpathcurveto{\pgfqpoint{0.643cm}{1.536cm}}{\pgfqpoint{0.608cm}{1.521cm}}{\pgfqpoint{0.582cm}{1.496cm}}
\pgfpathcurveto{\pgfqpoint{0.557cm}{1.47cm}}{\pgfqpoint{0.542cm}{1.435cm}}{\pgfqpoint{0.542cm}{1.399cm}}
\pgfpathcurveto{\pgfqpoint{0.542cm}{1.363cm}}{\pgfqpoint{0.557cm}{1.328cm}}{\pgfqpoint{0.582cm}{1.302cm}}
\pgfpathcurveto{\pgfqpoint{0.608cm}{1.276cm}}{\pgfqpoint{0.643cm}{1.262cm}}{\pgfqpoint{0.679cm}{1.262cm}}
\pgfpathcurveto{\pgfqpoint{0.715cm}{1.262cm}}{\pgfqpoint{0.75cm}{1.276cm}}{\pgfqpoint{0.775cm}{1.302cm}}
\pgfpathcurveto{\pgfqpoint{0.801cm}{1.328cm}}{\pgfqpoint{0.815cm}{1.363cm}}{\pgfqpoint{0.815cm}{1.399cm}}
\pgfusepath{fill}
\pgfpathmoveto{\pgfqpoint{1.345cm}{1.371cm}}
\pgfpathcurveto{\pgfqpoint{1.345cm}{1.408cm}}{\pgfqpoint{1.331cm}{1.442cm}}{\pgfqpoint{1.305cm}{1.468cm}}
\pgfpathcurveto{\pgfqpoint{1.28cm}{1.494cm}}{\pgfqpoint{1.245cm}{1.508cm}}{\pgfqpoint{1.209cm}{1.508cm}}
\pgfpathcurveto{\pgfqpoint{1.172cm}{1.508cm}}{\pgfqpoint{1.138cm}{1.494cm}}{\pgfqpoint{1.112cm}{1.468cm}}
\pgfpathcurveto{\pgfqpoint{1.087cm}{1.442cm}}{\pgfqpoint{1.072cm}{1.408cm}}{\pgfqpoint{1.072cm}{1.371cm}}
\pgfpathcurveto{\pgfqpoint{1.072cm}{1.335cm}}{\pgfqpoint{1.087cm}{1.3cm}}{\pgfqpoint{1.112cm}{1.274cm}}
\pgfpathcurveto{\pgfqpoint{1.138cm}{1.249cm}}{\pgfqpoint{1.172cm}{1.234cm}}{\pgfqpoint{1.209cm}{1.234cm}}
\pgfpathcurveto{\pgfqpoint{1.245cm}{1.234cm}}{\pgfqpoint{1.28cm}{1.249cm}}{\pgfqpoint{1.305cm}{1.274cm}}
\pgfpathcurveto{\pgfqpoint{1.331cm}{1.3cm}}{\pgfqpoint{1.345cm}{1.335cm}}{\pgfqpoint{1.345cm}{1.371cm}}
\pgfusepath{fill}
\begin{pgfscope}
\pgfsetdash{}{0cm}
\pgfsetlinewidth{0.818mm}
\pgfsetroundcap
\pgfsetmiterlimit{4.0}
\pgfpathmoveto{\pgfqpoint{0.682cm}{0.671cm}}
\pgfpathlineto{\pgfqpoint{0.682cm}{0.042cm}}
\pgfusepath{stroke}
\end{pgfscope}
\end{pgfscope}
\end{pgfscope}
\end{pgfscope}
\end{tikzpicture}}} + \phi + \psi) \prec]
      X^{\!\resizebox{0.6em}{!}{
\begin{tikzpicture}
\pgfpathmoveto{\pgfqpoint{0cm}{0cm}}
\pgfpathlineto{\pgfqpoint{1.376cm}{0cm}}
\pgfpathlineto{\pgfqpoint{1.376cm}{1.588cm}}
\pgfpathlineto{\pgfqpoint{0cm}{1.588cm}}
\pgfpathclose
\pgfusepath{clip}
\begin{pgfscope}
\begin{pgfscope}
\pgfpathmoveto{\pgfqpoint{0cm}{0cm}}
\pgfpathlineto{\pgfqpoint{1.376cm}{0cm}}
\pgfpathlineto{\pgfqpoint{1.376cm}{1.588cm}}
\pgfpathlineto{\pgfqpoint{0cm}{1.588cm}}
\pgfpathclose
\pgfusepath{clip}
\begin{pgfscope}
\begin{pgfscope}
\definecolor{eps2pgf_color}{gray}{0.976471}\pgfsetstrokecolor{eps2pgf_color}\pgfsetfillcolor{eps2pgf_color}
\pgfpathmoveto{\pgfqpoint{0cm}{0cm}}
\pgfpathlineto{\pgfqpoint{1.376cm}{0cm}}
\pgfpathlineto{\pgfqpoint{1.376cm}{1.588cm}}
\pgfpathlineto{\pgfqpoint{0cm}{1.588cm}}
\pgfpathclose
\pgfusepath{fill}
\end{pgfscope}
\begin{pgfscope}
\pgfsetdash{}{0cm}
\pgfsetlinewidth{0.818mm}
\pgfsetroundcap
\pgfsetroundjoin
\pgfsetmiterlimit{7.0}
\definecolor{eps2pgf_color}{gray}{0}\pgfsetstrokecolor{eps2pgf_color}\pgfsetfillcolor{eps2pgf_color}
\pgfpathmoveto{\pgfqpoint{0.117cm}{1.476cm}}
\pgfpathlineto{\pgfqpoint{0.682cm}{0.726cm}}
\pgfpathlineto{\pgfqpoint{1.246cm}{1.476cm}}
\pgfusepath{stroke}
\end{pgfscope}
\definecolor{eps2pgf_color}{gray}{0}\pgfsetstrokecolor{eps2pgf_color}\pgfsetfillcolor{eps2pgf_color}
\pgfpathmoveto{\pgfqpoint{0.273cm}{1.451cm}}
\pgfpathcurveto{\pgfqpoint{0.273cm}{1.487cm}}{\pgfqpoint{0.259cm}{1.522cm}}{\pgfqpoint{0.233cm}{1.547cm}}
\pgfpathcurveto{\pgfqpoint{0.207cm}{1.573cm}}{\pgfqpoint{0.173cm}{1.588cm}}{\pgfqpoint{0.137cm}{1.588cm}}
\pgfpathcurveto{\pgfqpoint{0.1cm}{1.588cm}}{\pgfqpoint{0.066cm}{1.573cm}}{\pgfqpoint{0.04cm}{1.547cm}}
\pgfpathcurveto{\pgfqpoint{0.014cm}{1.522cm}}{\pgfqpoint{0cm}{1.487cm}}{\pgfqpoint{0cm}{1.451cm}}
\pgfpathcurveto{\pgfqpoint{0cm}{1.414cm}}{\pgfqpoint{0.014cm}{1.379cm}}{\pgfqpoint{0.04cm}{1.354cm}}
\pgfpathcurveto{\pgfqpoint{0.066cm}{1.328cm}}{\pgfqpoint{0.1cm}{1.314cm}}{\pgfqpoint{0.137cm}{1.314cm}}
\pgfpathcurveto{\pgfqpoint{0.173cm}{1.314cm}}{\pgfqpoint{0.207cm}{1.328cm}}{\pgfqpoint{0.233cm}{1.354cm}}
\pgfpathcurveto{\pgfqpoint{0.259cm}{1.379cm}}{\pgfqpoint{0.273cm}{1.414cm}}{\pgfqpoint{0.273cm}{1.451cm}}
\pgfusepath{fill}
\pgfpathmoveto{\pgfqpoint{1.345cm}{1.426cm}}
\pgfpathcurveto{\pgfqpoint{1.345cm}{1.463cm}}{\pgfqpoint{1.331cm}{1.497cm}}{\pgfqpoint{1.305cm}{1.523cm}}
\pgfpathcurveto{\pgfqpoint{1.28cm}{1.549cm}}{\pgfqpoint{1.245cm}{1.563cm}}{\pgfqpoint{1.209cm}{1.563cm}}
\pgfpathcurveto{\pgfqpoint{1.172cm}{1.563cm}}{\pgfqpoint{1.138cm}{1.549cm}}{\pgfqpoint{1.112cm}{1.523cm}}
\pgfpathcurveto{\pgfqpoint{1.087cm}{1.497cm}}{\pgfqpoint{1.072cm}{1.463cm}}{\pgfqpoint{1.072cm}{1.426cm}}
\pgfpathcurveto{\pgfqpoint{1.072cm}{1.39cm}}{\pgfqpoint{1.087cm}{1.355cm}}{\pgfqpoint{1.112cm}{1.329cm}}
\pgfpathcurveto{\pgfqpoint{1.138cm}{1.304cm}}{\pgfqpoint{1.172cm}{1.289cm}}{\pgfqpoint{1.209cm}{1.289cm}}
\pgfpathcurveto{\pgfqpoint{1.245cm}{1.289cm}}{\pgfqpoint{1.28cm}{1.304cm}}{\pgfqpoint{1.305cm}{1.329cm}}
\pgfpathcurveto{\pgfqpoint{1.331cm}{1.355cm}}{\pgfqpoint{1.345cm}{1.39cm}}{\pgfqpoint{1.345cm}{1.426cm}}
\pgfusepath{fill}
\begin{pgfscope}
\pgfsetdash{}{0cm}
\pgfsetlinewidth{0.818mm}
\pgfsetroundcap
\pgfsetmiterlimit{4.0}
\pgfpathmoveto{\pgfqpoint{0.682cm}{0.726cm}}
\pgfpathlineto{\pgfqpoint{0.682cm}{0.097cm}}
\pgfusepath{stroke}
\end{pgfscope}
\end{pgfscope}
\end{pgfscope}
\end{pgfscope}
\end{tikzpicture}}}\\
      & \quad + 3 X (- X^{\!\resizebox{0.6em}{!}{
\begin{tikzpicture}
\pgfpathmoveto{\pgfqpoint{0cm}{-0.035cm}}
\pgfpathlineto{\pgfqpoint{1.376cm}{-0.035cm}}
\pgfpathlineto{\pgfqpoint{1.376cm}{1.552cm}}
\pgfpathlineto{\pgfqpoint{0cm}{1.552cm}}
\pgfpathclose
\pgfusepath{clip}
\begin{pgfscope}
\begin{pgfscope}
\pgfpathmoveto{\pgfqpoint{0cm}{-0.035cm}}
\pgfpathlineto{\pgfqpoint{1.376cm}{-0.035cm}}
\pgfpathlineto{\pgfqpoint{1.376cm}{1.552cm}}
\pgfpathlineto{\pgfqpoint{0cm}{1.552cm}}
\pgfpathclose
\pgfusepath{clip}
\begin{pgfscope}
\begin{pgfscope}
\pgfsetdash{}{0cm}
\pgfsetlinewidth{0.818mm}
\pgfsetroundcap
\pgfsetroundjoin
\pgfsetmiterlimit{7.0}
\definecolor{eps2pgf_color}{gray}{0}\pgfsetstrokecolor{eps2pgf_color}\pgfsetfillcolor{eps2pgf_color}
\pgfpathmoveto{\pgfqpoint{0.117cm}{1.421cm}}
\pgfpathlineto{\pgfqpoint{0.682cm}{0.671cm}}
\pgfpathlineto{\pgfqpoint{1.246cm}{1.421cm}}
\pgfusepath{stroke}
\end{pgfscope}
\definecolor{eps2pgf_color}{gray}{0}\pgfsetstrokecolor{eps2pgf_color}\pgfsetfillcolor{eps2pgf_color}
\pgfpathmoveto{\pgfqpoint{0.273cm}{1.395cm}}
\pgfpathcurveto{\pgfqpoint{0.273cm}{1.432cm}}{\pgfqpoint{0.259cm}{1.467cm}}{\pgfqpoint{0.233cm}{1.492cm}}
\pgfpathcurveto{\pgfqpoint{0.207cm}{1.518cm}}{\pgfqpoint{0.173cm}{1.532cm}}{\pgfqpoint{0.137cm}{1.532cm}}
\pgfpathcurveto{\pgfqpoint{0.1cm}{1.532cm}}{\pgfqpoint{0.066cm}{1.518cm}}{\pgfqpoint{0.04cm}{1.492cm}}
\pgfpathcurveto{\pgfqpoint{0.014cm}{1.467cm}}{\pgfqpoint{0cm}{1.432cm}}{\pgfqpoint{0cm}{1.395cm}}
\pgfpathcurveto{\pgfqpoint{0cm}{1.359cm}}{\pgfqpoint{0.014cm}{1.324cm}}{\pgfqpoint{0.04cm}{1.299cm}}
\pgfpathcurveto{\pgfqpoint{0.066cm}{1.273cm}}{\pgfqpoint{0.1cm}{1.258cm}}{\pgfqpoint{0.137cm}{1.258cm}}
\pgfpathcurveto{\pgfqpoint{0.173cm}{1.258cm}}{\pgfqpoint{0.207cm}{1.273cm}}{\pgfqpoint{0.233cm}{1.299cm}}
\pgfpathcurveto{\pgfqpoint{0.259cm}{1.324cm}}{\pgfqpoint{0.273cm}{1.359cm}}{\pgfqpoint{0.273cm}{1.395cm}}
\pgfusepath{fill}
\begin{pgfscope}
\pgfsetdash{}{0cm}
\pgfsetlinewidth{0.818mm}
\pgfsetmiterlimit{7.0}
\pgfpathmoveto{\pgfqpoint{0.682cm}{0.671cm}}
\pgfpathlineto{\pgfqpoint{0.679cm}{1.418cm}}
\pgfusepath{stroke}
\end{pgfscope}
\pgfpathmoveto{\pgfqpoint{0.815cm}{1.399cm}}
\pgfpathcurveto{\pgfqpoint{0.815cm}{1.435cm}}{\pgfqpoint{0.801cm}{1.47cm}}{\pgfqpoint{0.775cm}{1.496cm}}
\pgfpathcurveto{\pgfqpoint{0.75cm}{1.521cm}}{\pgfqpoint{0.715cm}{1.536cm}}{\pgfqpoint{0.679cm}{1.536cm}}
\pgfpathcurveto{\pgfqpoint{0.643cm}{1.536cm}}{\pgfqpoint{0.608cm}{1.521cm}}{\pgfqpoint{0.582cm}{1.496cm}}
\pgfpathcurveto{\pgfqpoint{0.557cm}{1.47cm}}{\pgfqpoint{0.542cm}{1.435cm}}{\pgfqpoint{0.542cm}{1.399cm}}
\pgfpathcurveto{\pgfqpoint{0.542cm}{1.363cm}}{\pgfqpoint{0.557cm}{1.328cm}}{\pgfqpoint{0.582cm}{1.302cm}}
\pgfpathcurveto{\pgfqpoint{0.608cm}{1.276cm}}{\pgfqpoint{0.643cm}{1.262cm}}{\pgfqpoint{0.679cm}{1.262cm}}
\pgfpathcurveto{\pgfqpoint{0.715cm}{1.262cm}}{\pgfqpoint{0.75cm}{1.276cm}}{\pgfqpoint{0.775cm}{1.302cm}}
\pgfpathcurveto{\pgfqpoint{0.801cm}{1.328cm}}{\pgfqpoint{0.815cm}{1.363cm}}{\pgfqpoint{0.815cm}{1.399cm}}
\pgfusepath{fill}
\pgfpathmoveto{\pgfqpoint{1.345cm}{1.371cm}}
\pgfpathcurveto{\pgfqpoint{1.345cm}{1.408cm}}{\pgfqpoint{1.331cm}{1.442cm}}{\pgfqpoint{1.305cm}{1.468cm}}
\pgfpathcurveto{\pgfqpoint{1.28cm}{1.494cm}}{\pgfqpoint{1.245cm}{1.508cm}}{\pgfqpoint{1.209cm}{1.508cm}}
\pgfpathcurveto{\pgfqpoint{1.172cm}{1.508cm}}{\pgfqpoint{1.138cm}{1.494cm}}{\pgfqpoint{1.112cm}{1.468cm}}
\pgfpathcurveto{\pgfqpoint{1.087cm}{1.442cm}}{\pgfqpoint{1.072cm}{1.408cm}}{\pgfqpoint{1.072cm}{1.371cm}}
\pgfpathcurveto{\pgfqpoint{1.072cm}{1.335cm}}{\pgfqpoint{1.087cm}{1.3cm}}{\pgfqpoint{1.112cm}{1.274cm}}
\pgfpathcurveto{\pgfqpoint{1.138cm}{1.249cm}}{\pgfqpoint{1.172cm}{1.234cm}}{\pgfqpoint{1.209cm}{1.234cm}}
\pgfpathcurveto{\pgfqpoint{1.245cm}{1.234cm}}{\pgfqpoint{1.28cm}{1.249cm}}{\pgfqpoint{1.305cm}{1.274cm}}
\pgfpathcurveto{\pgfqpoint{1.331cm}{1.3cm}}{\pgfqpoint{1.345cm}{1.335cm}}{\pgfqpoint{1.345cm}{1.371cm}}
\pgfusepath{fill}
\begin{pgfscope}
\pgfsetdash{}{0cm}
\pgfsetlinewidth{0.818mm}
\pgfsetroundcap
\pgfsetmiterlimit{4.0}
\pgfpathmoveto{\pgfqpoint{0.682cm}{0.671cm}}
\pgfpathlineto{\pgfqpoint{0.682cm}{0.042cm}}
\pgfusepath{stroke}
\end{pgfscope}
\end{pgfscope}
\end{pgfscope}
\end{pgfscope}
\end{tikzpicture}}} + \phi + \psi)^2 + (- X^{\!\resizebox{0.6em}{!}{
\begin{tikzpicture}
\pgfpathmoveto{\pgfqpoint{0cm}{-0.035cm}}
\pgfpathlineto{\pgfqpoint{1.376cm}{-0.035cm}}
\pgfpathlineto{\pgfqpoint{1.376cm}{1.552cm}}
\pgfpathlineto{\pgfqpoint{0cm}{1.552cm}}
\pgfpathclose
\pgfusepath{clip}
\begin{pgfscope}
\begin{pgfscope}
\pgfpathmoveto{\pgfqpoint{0cm}{-0.035cm}}
\pgfpathlineto{\pgfqpoint{1.376cm}{-0.035cm}}
\pgfpathlineto{\pgfqpoint{1.376cm}{1.552cm}}
\pgfpathlineto{\pgfqpoint{0cm}{1.552cm}}
\pgfpathclose
\pgfusepath{clip}
\begin{pgfscope}
\begin{pgfscope}
\pgfsetdash{}{0cm}
\pgfsetlinewidth{0.818mm}
\pgfsetroundcap
\pgfsetroundjoin
\pgfsetmiterlimit{7.0}
\definecolor{eps2pgf_color}{gray}{0}\pgfsetstrokecolor{eps2pgf_color}\pgfsetfillcolor{eps2pgf_color}
\pgfpathmoveto{\pgfqpoint{0.117cm}{1.421cm}}
\pgfpathlineto{\pgfqpoint{0.682cm}{0.671cm}}
\pgfpathlineto{\pgfqpoint{1.246cm}{1.421cm}}
\pgfusepath{stroke}
\end{pgfscope}
\definecolor{eps2pgf_color}{gray}{0}\pgfsetstrokecolor{eps2pgf_color}\pgfsetfillcolor{eps2pgf_color}
\pgfpathmoveto{\pgfqpoint{0.273cm}{1.395cm}}
\pgfpathcurveto{\pgfqpoint{0.273cm}{1.432cm}}{\pgfqpoint{0.259cm}{1.467cm}}{\pgfqpoint{0.233cm}{1.492cm}}
\pgfpathcurveto{\pgfqpoint{0.207cm}{1.518cm}}{\pgfqpoint{0.173cm}{1.532cm}}{\pgfqpoint{0.137cm}{1.532cm}}
\pgfpathcurveto{\pgfqpoint{0.1cm}{1.532cm}}{\pgfqpoint{0.066cm}{1.518cm}}{\pgfqpoint{0.04cm}{1.492cm}}
\pgfpathcurveto{\pgfqpoint{0.014cm}{1.467cm}}{\pgfqpoint{0cm}{1.432cm}}{\pgfqpoint{0cm}{1.395cm}}
\pgfpathcurveto{\pgfqpoint{0cm}{1.359cm}}{\pgfqpoint{0.014cm}{1.324cm}}{\pgfqpoint{0.04cm}{1.299cm}}
\pgfpathcurveto{\pgfqpoint{0.066cm}{1.273cm}}{\pgfqpoint{0.1cm}{1.258cm}}{\pgfqpoint{0.137cm}{1.258cm}}
\pgfpathcurveto{\pgfqpoint{0.173cm}{1.258cm}}{\pgfqpoint{0.207cm}{1.273cm}}{\pgfqpoint{0.233cm}{1.299cm}}
\pgfpathcurveto{\pgfqpoint{0.259cm}{1.324cm}}{\pgfqpoint{0.273cm}{1.359cm}}{\pgfqpoint{0.273cm}{1.395cm}}
\pgfusepath{fill}
\begin{pgfscope}
\pgfsetdash{}{0cm}
\pgfsetlinewidth{0.818mm}
\pgfsetmiterlimit{7.0}
\pgfpathmoveto{\pgfqpoint{0.682cm}{0.671cm}}
\pgfpathlineto{\pgfqpoint{0.679cm}{1.418cm}}
\pgfusepath{stroke}
\end{pgfscope}
\pgfpathmoveto{\pgfqpoint{0.815cm}{1.399cm}}
\pgfpathcurveto{\pgfqpoint{0.815cm}{1.435cm}}{\pgfqpoint{0.801cm}{1.47cm}}{\pgfqpoint{0.775cm}{1.496cm}}
\pgfpathcurveto{\pgfqpoint{0.75cm}{1.521cm}}{\pgfqpoint{0.715cm}{1.536cm}}{\pgfqpoint{0.679cm}{1.536cm}}
\pgfpathcurveto{\pgfqpoint{0.643cm}{1.536cm}}{\pgfqpoint{0.608cm}{1.521cm}}{\pgfqpoint{0.582cm}{1.496cm}}
\pgfpathcurveto{\pgfqpoint{0.557cm}{1.47cm}}{\pgfqpoint{0.542cm}{1.435cm}}{\pgfqpoint{0.542cm}{1.399cm}}
\pgfpathcurveto{\pgfqpoint{0.542cm}{1.363cm}}{\pgfqpoint{0.557cm}{1.328cm}}{\pgfqpoint{0.582cm}{1.302cm}}
\pgfpathcurveto{\pgfqpoint{0.608cm}{1.276cm}}{\pgfqpoint{0.643cm}{1.262cm}}{\pgfqpoint{0.679cm}{1.262cm}}
\pgfpathcurveto{\pgfqpoint{0.715cm}{1.262cm}}{\pgfqpoint{0.75cm}{1.276cm}}{\pgfqpoint{0.775cm}{1.302cm}}
\pgfpathcurveto{\pgfqpoint{0.801cm}{1.328cm}}{\pgfqpoint{0.815cm}{1.363cm}}{\pgfqpoint{0.815cm}{1.399cm}}
\pgfusepath{fill}
\pgfpathmoveto{\pgfqpoint{1.345cm}{1.371cm}}
\pgfpathcurveto{\pgfqpoint{1.345cm}{1.408cm}}{\pgfqpoint{1.331cm}{1.442cm}}{\pgfqpoint{1.305cm}{1.468cm}}
\pgfpathcurveto{\pgfqpoint{1.28cm}{1.494cm}}{\pgfqpoint{1.245cm}{1.508cm}}{\pgfqpoint{1.209cm}{1.508cm}}
\pgfpathcurveto{\pgfqpoint{1.172cm}{1.508cm}}{\pgfqpoint{1.138cm}{1.494cm}}{\pgfqpoint{1.112cm}{1.468cm}}
\pgfpathcurveto{\pgfqpoint{1.087cm}{1.442cm}}{\pgfqpoint{1.072cm}{1.408cm}}{\pgfqpoint{1.072cm}{1.371cm}}
\pgfpathcurveto{\pgfqpoint{1.072cm}{1.335cm}}{\pgfqpoint{1.087cm}{1.3cm}}{\pgfqpoint{1.112cm}{1.274cm}}
\pgfpathcurveto{\pgfqpoint{1.138cm}{1.249cm}}{\pgfqpoint{1.172cm}{1.234cm}}{\pgfqpoint{1.209cm}{1.234cm}}
\pgfpathcurveto{\pgfqpoint{1.245cm}{1.234cm}}{\pgfqpoint{1.28cm}{1.249cm}}{\pgfqpoint{1.305cm}{1.274cm}}
\pgfpathcurveto{\pgfqpoint{1.331cm}{1.3cm}}{\pgfqpoint{1.345cm}{1.335cm}}{\pgfqpoint{1.345cm}{1.371cm}}
\pgfusepath{fill}
\begin{pgfscope}
\pgfsetdash{}{0cm}
\pgfsetlinewidth{0.818mm}
\pgfsetroundcap
\pgfsetmiterlimit{4.0}
\pgfpathmoveto{\pgfqpoint{0.682cm}{0.671cm}}
\pgfpathlineto{\pgfqpoint{0.682cm}{0.042cm}}
\pgfusepath{stroke}
\end{pgfscope}
\end{pgfscope}
\end{pgfscope}
\end{pgfscope}
\end{tikzpicture}}} +
      \phi + \psi)^3 + 3 b \varphi .
  \end{align*}
For notational simplicity, we chose  to write the above equation in this not rigorous  form -- with the infinite constant $b$ appearing -- instead of introducing the full decomposition with all the trees. Indeed,  we  are actually interested in a difference of the corresponding equations for two solutions $\varphi$ and $\tilde\varphi$ starting from the same initial condition. Thus the decomposition will simplify as the terms that do not depend on the solutions  cancel out.

  So if we denote by $\tilde{\varphi} = X - X^{\!\resizebox{0.6em}{!}{
\begin{tikzpicture}
\pgfpathmoveto{\pgfqpoint{0cm}{-0.035cm}}
\pgfpathlineto{\pgfqpoint{1.376cm}{-0.035cm}}
\pgfpathlineto{\pgfqpoint{1.376cm}{1.552cm}}
\pgfpathlineto{\pgfqpoint{0cm}{1.552cm}}
\pgfpathclose
\pgfusepath{clip}
\begin{pgfscope}
\begin{pgfscope}
\pgfpathmoveto{\pgfqpoint{0cm}{-0.035cm}}
\pgfpathlineto{\pgfqpoint{1.376cm}{-0.035cm}}
\pgfpathlineto{\pgfqpoint{1.376cm}{1.552cm}}
\pgfpathlineto{\pgfqpoint{0cm}{1.552cm}}
\pgfpathclose
\pgfusepath{clip}
\begin{pgfscope}
\begin{pgfscope}
\pgfsetdash{}{0cm}
\pgfsetlinewidth{0.818mm}
\pgfsetroundcap
\pgfsetroundjoin
\pgfsetmiterlimit{7.0}
\definecolor{eps2pgf_color}{gray}{0}\pgfsetstrokecolor{eps2pgf_color}\pgfsetfillcolor{eps2pgf_color}
\pgfpathmoveto{\pgfqpoint{0.117cm}{1.421cm}}
\pgfpathlineto{\pgfqpoint{0.682cm}{0.671cm}}
\pgfpathlineto{\pgfqpoint{1.246cm}{1.421cm}}
\pgfusepath{stroke}
\end{pgfscope}
\definecolor{eps2pgf_color}{gray}{0}\pgfsetstrokecolor{eps2pgf_color}\pgfsetfillcolor{eps2pgf_color}
\pgfpathmoveto{\pgfqpoint{0.273cm}{1.395cm}}
\pgfpathcurveto{\pgfqpoint{0.273cm}{1.432cm}}{\pgfqpoint{0.259cm}{1.467cm}}{\pgfqpoint{0.233cm}{1.492cm}}
\pgfpathcurveto{\pgfqpoint{0.207cm}{1.518cm}}{\pgfqpoint{0.173cm}{1.532cm}}{\pgfqpoint{0.137cm}{1.532cm}}
\pgfpathcurveto{\pgfqpoint{0.1cm}{1.532cm}}{\pgfqpoint{0.066cm}{1.518cm}}{\pgfqpoint{0.04cm}{1.492cm}}
\pgfpathcurveto{\pgfqpoint{0.014cm}{1.467cm}}{\pgfqpoint{0cm}{1.432cm}}{\pgfqpoint{0cm}{1.395cm}}
\pgfpathcurveto{\pgfqpoint{0cm}{1.359cm}}{\pgfqpoint{0.014cm}{1.324cm}}{\pgfqpoint{0.04cm}{1.299cm}}
\pgfpathcurveto{\pgfqpoint{0.066cm}{1.273cm}}{\pgfqpoint{0.1cm}{1.258cm}}{\pgfqpoint{0.137cm}{1.258cm}}
\pgfpathcurveto{\pgfqpoint{0.173cm}{1.258cm}}{\pgfqpoint{0.207cm}{1.273cm}}{\pgfqpoint{0.233cm}{1.299cm}}
\pgfpathcurveto{\pgfqpoint{0.259cm}{1.324cm}}{\pgfqpoint{0.273cm}{1.359cm}}{\pgfqpoint{0.273cm}{1.395cm}}
\pgfusepath{fill}
\begin{pgfscope}
\pgfsetdash{}{0cm}
\pgfsetlinewidth{0.818mm}
\pgfsetmiterlimit{7.0}
\pgfpathmoveto{\pgfqpoint{0.682cm}{0.671cm}}
\pgfpathlineto{\pgfqpoint{0.679cm}{1.418cm}}
\pgfusepath{stroke}
\end{pgfscope}
\pgfpathmoveto{\pgfqpoint{0.815cm}{1.399cm}}
\pgfpathcurveto{\pgfqpoint{0.815cm}{1.435cm}}{\pgfqpoint{0.801cm}{1.47cm}}{\pgfqpoint{0.775cm}{1.496cm}}
\pgfpathcurveto{\pgfqpoint{0.75cm}{1.521cm}}{\pgfqpoint{0.715cm}{1.536cm}}{\pgfqpoint{0.679cm}{1.536cm}}
\pgfpathcurveto{\pgfqpoint{0.643cm}{1.536cm}}{\pgfqpoint{0.608cm}{1.521cm}}{\pgfqpoint{0.582cm}{1.496cm}}
\pgfpathcurveto{\pgfqpoint{0.557cm}{1.47cm}}{\pgfqpoint{0.542cm}{1.435cm}}{\pgfqpoint{0.542cm}{1.399cm}}
\pgfpathcurveto{\pgfqpoint{0.542cm}{1.363cm}}{\pgfqpoint{0.557cm}{1.328cm}}{\pgfqpoint{0.582cm}{1.302cm}}
\pgfpathcurveto{\pgfqpoint{0.608cm}{1.276cm}}{\pgfqpoint{0.643cm}{1.262cm}}{\pgfqpoint{0.679cm}{1.262cm}}
\pgfpathcurveto{\pgfqpoint{0.715cm}{1.262cm}}{\pgfqpoint{0.75cm}{1.276cm}}{\pgfqpoint{0.775cm}{1.302cm}}
\pgfpathcurveto{\pgfqpoint{0.801cm}{1.328cm}}{\pgfqpoint{0.815cm}{1.363cm}}{\pgfqpoint{0.815cm}{1.399cm}}
\pgfusepath{fill}
\pgfpathmoveto{\pgfqpoint{1.345cm}{1.371cm}}
\pgfpathcurveto{\pgfqpoint{1.345cm}{1.408cm}}{\pgfqpoint{1.331cm}{1.442cm}}{\pgfqpoint{1.305cm}{1.468cm}}
\pgfpathcurveto{\pgfqpoint{1.28cm}{1.494cm}}{\pgfqpoint{1.245cm}{1.508cm}}{\pgfqpoint{1.209cm}{1.508cm}}
\pgfpathcurveto{\pgfqpoint{1.172cm}{1.508cm}}{\pgfqpoint{1.138cm}{1.494cm}}{\pgfqpoint{1.112cm}{1.468cm}}
\pgfpathcurveto{\pgfqpoint{1.087cm}{1.442cm}}{\pgfqpoint{1.072cm}{1.408cm}}{\pgfqpoint{1.072cm}{1.371cm}}
\pgfpathcurveto{\pgfqpoint{1.072cm}{1.335cm}}{\pgfqpoint{1.087cm}{1.3cm}}{\pgfqpoint{1.112cm}{1.274cm}}
\pgfpathcurveto{\pgfqpoint{1.138cm}{1.249cm}}{\pgfqpoint{1.172cm}{1.234cm}}{\pgfqpoint{1.209cm}{1.234cm}}
\pgfpathcurveto{\pgfqpoint{1.245cm}{1.234cm}}{\pgfqpoint{1.28cm}{1.249cm}}{\pgfqpoint{1.305cm}{1.274cm}}
\pgfpathcurveto{\pgfqpoint{1.331cm}{1.3cm}}{\pgfqpoint{1.345cm}{1.335cm}}{\pgfqpoint{1.345cm}{1.371cm}}
\pgfusepath{fill}
\begin{pgfscope}
\pgfsetdash{}{0cm}
\pgfsetlinewidth{0.818mm}
\pgfsetroundcap
\pgfsetmiterlimit{4.0}
\pgfpathmoveto{\pgfqpoint{0.682cm}{0.671cm}}
\pgfpathlineto{\pgfqpoint{0.682cm}{0.042cm}}
\pgfusepath{stroke}
\end{pgfscope}
\end{pgfscope}
\end{pgfscope}
\end{pgfscope}
\end{tikzpicture}}} + \tilde{\phi} +
  \tilde{\psi}$ another solution starting from the same initial condition and set $\zeta = \varphi - \tilde{\varphi}$
  and $\eta = \theta - \tilde{\theta} + \psi - \tilde{\psi}$, then we
  have
  \[ \zeta = \phi + \psi - \tilde{\phi} - \tilde{\psi} = - 3 \zeta \prec
     X^{\!\resizebox{0.6em}{!}{
\begin{tikzpicture}
\pgfpathmoveto{\pgfqpoint{0cm}{0cm}}
\pgfpathlineto{\pgfqpoint{1.376cm}{0cm}}
\pgfpathlineto{\pgfqpoint{1.376cm}{1.588cm}}
\pgfpathlineto{\pgfqpoint{0cm}{1.588cm}}
\pgfpathclose
\pgfusepath{clip}
\begin{pgfscope}
\begin{pgfscope}
\pgfpathmoveto{\pgfqpoint{0cm}{0cm}}
\pgfpathlineto{\pgfqpoint{1.376cm}{0cm}}
\pgfpathlineto{\pgfqpoint{1.376cm}{1.588cm}}
\pgfpathlineto{\pgfqpoint{0cm}{1.588cm}}
\pgfpathclose
\pgfusepath{clip}
\begin{pgfscope}
\begin{pgfscope}
\definecolor{eps2pgf_color}{gray}{0.976471}\pgfsetstrokecolor{eps2pgf_color}\pgfsetfillcolor{eps2pgf_color}
\pgfpathmoveto{\pgfqpoint{0cm}{0cm}}
\pgfpathlineto{\pgfqpoint{1.376cm}{0cm}}
\pgfpathlineto{\pgfqpoint{1.376cm}{1.588cm}}
\pgfpathlineto{\pgfqpoint{0cm}{1.588cm}}
\pgfpathclose
\pgfusepath{fill}
\end{pgfscope}
\begin{pgfscope}
\pgfsetdash{}{0cm}
\pgfsetlinewidth{0.818mm}
\pgfsetroundcap
\pgfsetroundjoin
\pgfsetmiterlimit{7.0}
\definecolor{eps2pgf_color}{gray}{0}\pgfsetstrokecolor{eps2pgf_color}\pgfsetfillcolor{eps2pgf_color}
\pgfpathmoveto{\pgfqpoint{0.117cm}{1.476cm}}
\pgfpathlineto{\pgfqpoint{0.682cm}{0.726cm}}
\pgfpathlineto{\pgfqpoint{1.246cm}{1.476cm}}
\pgfusepath{stroke}
\end{pgfscope}
\definecolor{eps2pgf_color}{gray}{0}\pgfsetstrokecolor{eps2pgf_color}\pgfsetfillcolor{eps2pgf_color}
\pgfpathmoveto{\pgfqpoint{0.273cm}{1.451cm}}
\pgfpathcurveto{\pgfqpoint{0.273cm}{1.487cm}}{\pgfqpoint{0.259cm}{1.522cm}}{\pgfqpoint{0.233cm}{1.547cm}}
\pgfpathcurveto{\pgfqpoint{0.207cm}{1.573cm}}{\pgfqpoint{0.173cm}{1.588cm}}{\pgfqpoint{0.137cm}{1.588cm}}
\pgfpathcurveto{\pgfqpoint{0.1cm}{1.588cm}}{\pgfqpoint{0.066cm}{1.573cm}}{\pgfqpoint{0.04cm}{1.547cm}}
\pgfpathcurveto{\pgfqpoint{0.014cm}{1.522cm}}{\pgfqpoint{0cm}{1.487cm}}{\pgfqpoint{0cm}{1.451cm}}
\pgfpathcurveto{\pgfqpoint{0cm}{1.414cm}}{\pgfqpoint{0.014cm}{1.379cm}}{\pgfqpoint{0.04cm}{1.354cm}}
\pgfpathcurveto{\pgfqpoint{0.066cm}{1.328cm}}{\pgfqpoint{0.1cm}{1.314cm}}{\pgfqpoint{0.137cm}{1.314cm}}
\pgfpathcurveto{\pgfqpoint{0.173cm}{1.314cm}}{\pgfqpoint{0.207cm}{1.328cm}}{\pgfqpoint{0.233cm}{1.354cm}}
\pgfpathcurveto{\pgfqpoint{0.259cm}{1.379cm}}{\pgfqpoint{0.273cm}{1.414cm}}{\pgfqpoint{0.273cm}{1.451cm}}
\pgfusepath{fill}
\pgfpathmoveto{\pgfqpoint{1.345cm}{1.426cm}}
\pgfpathcurveto{\pgfqpoint{1.345cm}{1.463cm}}{\pgfqpoint{1.331cm}{1.497cm}}{\pgfqpoint{1.305cm}{1.523cm}}
\pgfpathcurveto{\pgfqpoint{1.28cm}{1.549cm}}{\pgfqpoint{1.245cm}{1.563cm}}{\pgfqpoint{1.209cm}{1.563cm}}
\pgfpathcurveto{\pgfqpoint{1.172cm}{1.563cm}}{\pgfqpoint{1.138cm}{1.549cm}}{\pgfqpoint{1.112cm}{1.523cm}}
\pgfpathcurveto{\pgfqpoint{1.087cm}{1.497cm}}{\pgfqpoint{1.072cm}{1.463cm}}{\pgfqpoint{1.072cm}{1.426cm}}
\pgfpathcurveto{\pgfqpoint{1.072cm}{1.39cm}}{\pgfqpoint{1.087cm}{1.355cm}}{\pgfqpoint{1.112cm}{1.329cm}}
\pgfpathcurveto{\pgfqpoint{1.138cm}{1.304cm}}{\pgfqpoint{1.172cm}{1.289cm}}{\pgfqpoint{1.209cm}{1.289cm}}
\pgfpathcurveto{\pgfqpoint{1.245cm}{1.289cm}}{\pgfqpoint{1.28cm}{1.304cm}}{\pgfqpoint{1.305cm}{1.329cm}}
\pgfpathcurveto{\pgfqpoint{1.331cm}{1.355cm}}{\pgfqpoint{1.345cm}{1.39cm}}{\pgfqpoint{1.345cm}{1.426cm}}
\pgfusepath{fill}
\begin{pgfscope}
\pgfsetdash{}{0cm}
\pgfsetlinewidth{0.818mm}
\pgfsetroundcap
\pgfsetmiterlimit{4.0}
\pgfpathmoveto{\pgfqpoint{0.682cm}{0.726cm}}
\pgfpathlineto{\pgfqpoint{0.682cm}{0.097cm}}
\pgfusepath{stroke}
\end{pgfscope}
\end{pgfscope}
\end{pgfscope}
\end{pgfscope}
\end{tikzpicture}}} + \eta . \]
  In addition, it holds
  \[ 0 = \LL \zeta + 3 \llbracket X^2 \rrbracket \zeta + 3 b \zeta + \Upsilon
     \zeta, \qquad \zeta(0)=0,\]
  \[ 0 = \LL \eta + 3 \llbracket X^2 \rrbracket \preccurlyeq
     \zeta + 3 b \zeta - 3 [\LL, \zeta \prec] X^{\!\resizebox{0.6em}{!}{
\begin{tikzpicture}
\pgfpathmoveto{\pgfqpoint{0cm}{0cm}}
\pgfpathlineto{\pgfqpoint{1.376cm}{0cm}}
\pgfpathlineto{\pgfqpoint{1.376cm}{1.588cm}}
\pgfpathlineto{\pgfqpoint{0cm}{1.588cm}}
\pgfpathclose
\pgfusepath{clip}
\begin{pgfscope}
\begin{pgfscope}
\pgfpathmoveto{\pgfqpoint{0cm}{0cm}}
\pgfpathlineto{\pgfqpoint{1.376cm}{0cm}}
\pgfpathlineto{\pgfqpoint{1.376cm}{1.588cm}}
\pgfpathlineto{\pgfqpoint{0cm}{1.588cm}}
\pgfpathclose
\pgfusepath{clip}
\begin{pgfscope}
\begin{pgfscope}
\definecolor{eps2pgf_color}{gray}{0.976471}\pgfsetstrokecolor{eps2pgf_color}\pgfsetfillcolor{eps2pgf_color}
\pgfpathmoveto{\pgfqpoint{0cm}{0cm}}
\pgfpathlineto{\pgfqpoint{1.376cm}{0cm}}
\pgfpathlineto{\pgfqpoint{1.376cm}{1.588cm}}
\pgfpathlineto{\pgfqpoint{0cm}{1.588cm}}
\pgfpathclose
\pgfusepath{fill}
\end{pgfscope}
\begin{pgfscope}
\pgfsetdash{}{0cm}
\pgfsetlinewidth{0.818mm}
\pgfsetroundcap
\pgfsetroundjoin
\pgfsetmiterlimit{7.0}
\definecolor{eps2pgf_color}{gray}{0}\pgfsetstrokecolor{eps2pgf_color}\pgfsetfillcolor{eps2pgf_color}
\pgfpathmoveto{\pgfqpoint{0.117cm}{1.476cm}}
\pgfpathlineto{\pgfqpoint{0.682cm}{0.726cm}}
\pgfpathlineto{\pgfqpoint{1.246cm}{1.476cm}}
\pgfusepath{stroke}
\end{pgfscope}
\definecolor{eps2pgf_color}{gray}{0}\pgfsetstrokecolor{eps2pgf_color}\pgfsetfillcolor{eps2pgf_color}
\pgfpathmoveto{\pgfqpoint{0.273cm}{1.451cm}}
\pgfpathcurveto{\pgfqpoint{0.273cm}{1.487cm}}{\pgfqpoint{0.259cm}{1.522cm}}{\pgfqpoint{0.233cm}{1.547cm}}
\pgfpathcurveto{\pgfqpoint{0.207cm}{1.573cm}}{\pgfqpoint{0.173cm}{1.588cm}}{\pgfqpoint{0.137cm}{1.588cm}}
\pgfpathcurveto{\pgfqpoint{0.1cm}{1.588cm}}{\pgfqpoint{0.066cm}{1.573cm}}{\pgfqpoint{0.04cm}{1.547cm}}
\pgfpathcurveto{\pgfqpoint{0.014cm}{1.522cm}}{\pgfqpoint{0cm}{1.487cm}}{\pgfqpoint{0cm}{1.451cm}}
\pgfpathcurveto{\pgfqpoint{0cm}{1.414cm}}{\pgfqpoint{0.014cm}{1.379cm}}{\pgfqpoint{0.04cm}{1.354cm}}
\pgfpathcurveto{\pgfqpoint{0.066cm}{1.328cm}}{\pgfqpoint{0.1cm}{1.314cm}}{\pgfqpoint{0.137cm}{1.314cm}}
\pgfpathcurveto{\pgfqpoint{0.173cm}{1.314cm}}{\pgfqpoint{0.207cm}{1.328cm}}{\pgfqpoint{0.233cm}{1.354cm}}
\pgfpathcurveto{\pgfqpoint{0.259cm}{1.379cm}}{\pgfqpoint{0.273cm}{1.414cm}}{\pgfqpoint{0.273cm}{1.451cm}}
\pgfusepath{fill}
\pgfpathmoveto{\pgfqpoint{1.345cm}{1.426cm}}
\pgfpathcurveto{\pgfqpoint{1.345cm}{1.463cm}}{\pgfqpoint{1.331cm}{1.497cm}}{\pgfqpoint{1.305cm}{1.523cm}}
\pgfpathcurveto{\pgfqpoint{1.28cm}{1.549cm}}{\pgfqpoint{1.245cm}{1.563cm}}{\pgfqpoint{1.209cm}{1.563cm}}
\pgfpathcurveto{\pgfqpoint{1.172cm}{1.563cm}}{\pgfqpoint{1.138cm}{1.549cm}}{\pgfqpoint{1.112cm}{1.523cm}}
\pgfpathcurveto{\pgfqpoint{1.087cm}{1.497cm}}{\pgfqpoint{1.072cm}{1.463cm}}{\pgfqpoint{1.072cm}{1.426cm}}
\pgfpathcurveto{\pgfqpoint{1.072cm}{1.39cm}}{\pgfqpoint{1.087cm}{1.355cm}}{\pgfqpoint{1.112cm}{1.329cm}}
\pgfpathcurveto{\pgfqpoint{1.138cm}{1.304cm}}{\pgfqpoint{1.172cm}{1.289cm}}{\pgfqpoint{1.209cm}{1.289cm}}
\pgfpathcurveto{\pgfqpoint{1.245cm}{1.289cm}}{\pgfqpoint{1.28cm}{1.304cm}}{\pgfqpoint{1.305cm}{1.329cm}}
\pgfpathcurveto{\pgfqpoint{1.331cm}{1.355cm}}{\pgfqpoint{1.345cm}{1.39cm}}{\pgfqpoint{1.345cm}{1.426cm}}
\pgfusepath{fill}
\begin{pgfscope}
\pgfsetdash{}{0cm}
\pgfsetlinewidth{0.818mm}
\pgfsetroundcap
\pgfsetmiterlimit{4.0}
\pgfpathmoveto{\pgfqpoint{0.682cm}{0.726cm}}
\pgfpathlineto{\pgfqpoint{0.682cm}{0.097cm}}
\pgfusepath{stroke}
\end{pgfscope}
\end{pgfscope}
\end{pgfscope}
\end{pgfscope}
\end{tikzpicture}}} + \Upsilon
     \zeta,  \qquad \eta(0)=0,\]
  with
  \begin{align*}
      \Upsilon = & 3 X (- 2 X^{\!\resizebox{0.6em}{!}{
\begin{tikzpicture}
\pgfpathmoveto{\pgfqpoint{0cm}{-0.035cm}}
\pgfpathlineto{\pgfqpoint{1.376cm}{-0.035cm}}
\pgfpathlineto{\pgfqpoint{1.376cm}{1.552cm}}
\pgfpathlineto{\pgfqpoint{0cm}{1.552cm}}
\pgfpathclose
\pgfusepath{clip}
\begin{pgfscope}
\begin{pgfscope}
\pgfpathmoveto{\pgfqpoint{0cm}{-0.035cm}}
\pgfpathlineto{\pgfqpoint{1.376cm}{-0.035cm}}
\pgfpathlineto{\pgfqpoint{1.376cm}{1.552cm}}
\pgfpathlineto{\pgfqpoint{0cm}{1.552cm}}
\pgfpathclose
\pgfusepath{clip}
\begin{pgfscope}
\begin{pgfscope}
\pgfsetdash{}{0cm}
\pgfsetlinewidth{0.818mm}
\pgfsetroundcap
\pgfsetroundjoin
\pgfsetmiterlimit{7.0}
\definecolor{eps2pgf_color}{gray}{0}\pgfsetstrokecolor{eps2pgf_color}\pgfsetfillcolor{eps2pgf_color}
\pgfpathmoveto{\pgfqpoint{0.117cm}{1.421cm}}
\pgfpathlineto{\pgfqpoint{0.682cm}{0.671cm}}
\pgfpathlineto{\pgfqpoint{1.246cm}{1.421cm}}
\pgfusepath{stroke}
\end{pgfscope}
\definecolor{eps2pgf_color}{gray}{0}\pgfsetstrokecolor{eps2pgf_color}\pgfsetfillcolor{eps2pgf_color}
\pgfpathmoveto{\pgfqpoint{0.273cm}{1.395cm}}
\pgfpathcurveto{\pgfqpoint{0.273cm}{1.432cm}}{\pgfqpoint{0.259cm}{1.467cm}}{\pgfqpoint{0.233cm}{1.492cm}}
\pgfpathcurveto{\pgfqpoint{0.207cm}{1.518cm}}{\pgfqpoint{0.173cm}{1.532cm}}{\pgfqpoint{0.137cm}{1.532cm}}
\pgfpathcurveto{\pgfqpoint{0.1cm}{1.532cm}}{\pgfqpoint{0.066cm}{1.518cm}}{\pgfqpoint{0.04cm}{1.492cm}}
\pgfpathcurveto{\pgfqpoint{0.014cm}{1.467cm}}{\pgfqpoint{0cm}{1.432cm}}{\pgfqpoint{0cm}{1.395cm}}
\pgfpathcurveto{\pgfqpoint{0cm}{1.359cm}}{\pgfqpoint{0.014cm}{1.324cm}}{\pgfqpoint{0.04cm}{1.299cm}}
\pgfpathcurveto{\pgfqpoint{0.066cm}{1.273cm}}{\pgfqpoint{0.1cm}{1.258cm}}{\pgfqpoint{0.137cm}{1.258cm}}
\pgfpathcurveto{\pgfqpoint{0.173cm}{1.258cm}}{\pgfqpoint{0.207cm}{1.273cm}}{\pgfqpoint{0.233cm}{1.299cm}}
\pgfpathcurveto{\pgfqpoint{0.259cm}{1.324cm}}{\pgfqpoint{0.273cm}{1.359cm}}{\pgfqpoint{0.273cm}{1.395cm}}
\pgfusepath{fill}
\begin{pgfscope}
\pgfsetdash{}{0cm}
\pgfsetlinewidth{0.818mm}
\pgfsetmiterlimit{7.0}
\pgfpathmoveto{\pgfqpoint{0.682cm}{0.671cm}}
\pgfpathlineto{\pgfqpoint{0.679cm}{1.418cm}}
\pgfusepath{stroke}
\end{pgfscope}
\pgfpathmoveto{\pgfqpoint{0.815cm}{1.399cm}}
\pgfpathcurveto{\pgfqpoint{0.815cm}{1.435cm}}{\pgfqpoint{0.801cm}{1.47cm}}{\pgfqpoint{0.775cm}{1.496cm}}
\pgfpathcurveto{\pgfqpoint{0.75cm}{1.521cm}}{\pgfqpoint{0.715cm}{1.536cm}}{\pgfqpoint{0.679cm}{1.536cm}}
\pgfpathcurveto{\pgfqpoint{0.643cm}{1.536cm}}{\pgfqpoint{0.608cm}{1.521cm}}{\pgfqpoint{0.582cm}{1.496cm}}
\pgfpathcurveto{\pgfqpoint{0.557cm}{1.47cm}}{\pgfqpoint{0.542cm}{1.435cm}}{\pgfqpoint{0.542cm}{1.399cm}}
\pgfpathcurveto{\pgfqpoint{0.542cm}{1.363cm}}{\pgfqpoint{0.557cm}{1.328cm}}{\pgfqpoint{0.582cm}{1.302cm}}
\pgfpathcurveto{\pgfqpoint{0.608cm}{1.276cm}}{\pgfqpoint{0.643cm}{1.262cm}}{\pgfqpoint{0.679cm}{1.262cm}}
\pgfpathcurveto{\pgfqpoint{0.715cm}{1.262cm}}{\pgfqpoint{0.75cm}{1.276cm}}{\pgfqpoint{0.775cm}{1.302cm}}
\pgfpathcurveto{\pgfqpoint{0.801cm}{1.328cm}}{\pgfqpoint{0.815cm}{1.363cm}}{\pgfqpoint{0.815cm}{1.399cm}}
\pgfusepath{fill}
\pgfpathmoveto{\pgfqpoint{1.345cm}{1.371cm}}
\pgfpathcurveto{\pgfqpoint{1.345cm}{1.408cm}}{\pgfqpoint{1.331cm}{1.442cm}}{\pgfqpoint{1.305cm}{1.468cm}}
\pgfpathcurveto{\pgfqpoint{1.28cm}{1.494cm}}{\pgfqpoint{1.245cm}{1.508cm}}{\pgfqpoint{1.209cm}{1.508cm}}
\pgfpathcurveto{\pgfqpoint{1.172cm}{1.508cm}}{\pgfqpoint{1.138cm}{1.494cm}}{\pgfqpoint{1.112cm}{1.468cm}}
\pgfpathcurveto{\pgfqpoint{1.087cm}{1.442cm}}{\pgfqpoint{1.072cm}{1.408cm}}{\pgfqpoint{1.072cm}{1.371cm}}
\pgfpathcurveto{\pgfqpoint{1.072cm}{1.335cm}}{\pgfqpoint{1.087cm}{1.3cm}}{\pgfqpoint{1.112cm}{1.274cm}}
\pgfpathcurveto{\pgfqpoint{1.138cm}{1.249cm}}{\pgfqpoint{1.172cm}{1.234cm}}{\pgfqpoint{1.209cm}{1.234cm}}
\pgfpathcurveto{\pgfqpoint{1.245cm}{1.234cm}}{\pgfqpoint{1.28cm}{1.249cm}}{\pgfqpoint{1.305cm}{1.274cm}}
\pgfpathcurveto{\pgfqpoint{1.331cm}{1.3cm}}{\pgfqpoint{1.345cm}{1.335cm}}{\pgfqpoint{1.345cm}{1.371cm}}
\pgfusepath{fill}
\begin{pgfscope}
\pgfsetdash{}{0cm}
\pgfsetlinewidth{0.818mm}
\pgfsetroundcap
\pgfsetmiterlimit{4.0}
\pgfpathmoveto{\pgfqpoint{0.682cm}{0.671cm}}
\pgfpathlineto{\pgfqpoint{0.682cm}{0.042cm}}
\pgfusepath{stroke}
\end{pgfscope}
\end{pgfscope}
\end{pgfscope}
\end{pgfscope}
\end{tikzpicture}}} + \phi + \psi + \tilde{\phi} +
      \tilde{\psi}) \\
      & + [(- X^{\!\resizebox{0.6em}{!}{
\begin{tikzpicture}
\pgfpathmoveto{\pgfqpoint{0cm}{-0.035cm}}
\pgfpathlineto{\pgfqpoint{1.376cm}{-0.035cm}}
\pgfpathlineto{\pgfqpoint{1.376cm}{1.552cm}}
\pgfpathlineto{\pgfqpoint{0cm}{1.552cm}}
\pgfpathclose
\pgfusepath{clip}
\begin{pgfscope}
\begin{pgfscope}
\pgfpathmoveto{\pgfqpoint{0cm}{-0.035cm}}
\pgfpathlineto{\pgfqpoint{1.376cm}{-0.035cm}}
\pgfpathlineto{\pgfqpoint{1.376cm}{1.552cm}}
\pgfpathlineto{\pgfqpoint{0cm}{1.552cm}}
\pgfpathclose
\pgfusepath{clip}
\begin{pgfscope}
\begin{pgfscope}
\pgfsetdash{}{0cm}
\pgfsetlinewidth{0.818mm}
\pgfsetroundcap
\pgfsetroundjoin
\pgfsetmiterlimit{7.0}
\definecolor{eps2pgf_color}{gray}{0}\pgfsetstrokecolor{eps2pgf_color}\pgfsetfillcolor{eps2pgf_color}
\pgfpathmoveto{\pgfqpoint{0.117cm}{1.421cm}}
\pgfpathlineto{\pgfqpoint{0.682cm}{0.671cm}}
\pgfpathlineto{\pgfqpoint{1.246cm}{1.421cm}}
\pgfusepath{stroke}
\end{pgfscope}
\definecolor{eps2pgf_color}{gray}{0}\pgfsetstrokecolor{eps2pgf_color}\pgfsetfillcolor{eps2pgf_color}
\pgfpathmoveto{\pgfqpoint{0.273cm}{1.395cm}}
\pgfpathcurveto{\pgfqpoint{0.273cm}{1.432cm}}{\pgfqpoint{0.259cm}{1.467cm}}{\pgfqpoint{0.233cm}{1.492cm}}
\pgfpathcurveto{\pgfqpoint{0.207cm}{1.518cm}}{\pgfqpoint{0.173cm}{1.532cm}}{\pgfqpoint{0.137cm}{1.532cm}}
\pgfpathcurveto{\pgfqpoint{0.1cm}{1.532cm}}{\pgfqpoint{0.066cm}{1.518cm}}{\pgfqpoint{0.04cm}{1.492cm}}
\pgfpathcurveto{\pgfqpoint{0.014cm}{1.467cm}}{\pgfqpoint{0cm}{1.432cm}}{\pgfqpoint{0cm}{1.395cm}}
\pgfpathcurveto{\pgfqpoint{0cm}{1.359cm}}{\pgfqpoint{0.014cm}{1.324cm}}{\pgfqpoint{0.04cm}{1.299cm}}
\pgfpathcurveto{\pgfqpoint{0.066cm}{1.273cm}}{\pgfqpoint{0.1cm}{1.258cm}}{\pgfqpoint{0.137cm}{1.258cm}}
\pgfpathcurveto{\pgfqpoint{0.173cm}{1.258cm}}{\pgfqpoint{0.207cm}{1.273cm}}{\pgfqpoint{0.233cm}{1.299cm}}
\pgfpathcurveto{\pgfqpoint{0.259cm}{1.324cm}}{\pgfqpoint{0.273cm}{1.359cm}}{\pgfqpoint{0.273cm}{1.395cm}}
\pgfusepath{fill}
\begin{pgfscope}
\pgfsetdash{}{0cm}
\pgfsetlinewidth{0.818mm}
\pgfsetmiterlimit{7.0}
\pgfpathmoveto{\pgfqpoint{0.682cm}{0.671cm}}
\pgfpathlineto{\pgfqpoint{0.679cm}{1.418cm}}
\pgfusepath{stroke}
\end{pgfscope}
\pgfpathmoveto{\pgfqpoint{0.815cm}{1.399cm}}
\pgfpathcurveto{\pgfqpoint{0.815cm}{1.435cm}}{\pgfqpoint{0.801cm}{1.47cm}}{\pgfqpoint{0.775cm}{1.496cm}}
\pgfpathcurveto{\pgfqpoint{0.75cm}{1.521cm}}{\pgfqpoint{0.715cm}{1.536cm}}{\pgfqpoint{0.679cm}{1.536cm}}
\pgfpathcurveto{\pgfqpoint{0.643cm}{1.536cm}}{\pgfqpoint{0.608cm}{1.521cm}}{\pgfqpoint{0.582cm}{1.496cm}}
\pgfpathcurveto{\pgfqpoint{0.557cm}{1.47cm}}{\pgfqpoint{0.542cm}{1.435cm}}{\pgfqpoint{0.542cm}{1.399cm}}
\pgfpathcurveto{\pgfqpoint{0.542cm}{1.363cm}}{\pgfqpoint{0.557cm}{1.328cm}}{\pgfqpoint{0.582cm}{1.302cm}}
\pgfpathcurveto{\pgfqpoint{0.608cm}{1.276cm}}{\pgfqpoint{0.643cm}{1.262cm}}{\pgfqpoint{0.679cm}{1.262cm}}
\pgfpathcurveto{\pgfqpoint{0.715cm}{1.262cm}}{\pgfqpoint{0.75cm}{1.276cm}}{\pgfqpoint{0.775cm}{1.302cm}}
\pgfpathcurveto{\pgfqpoint{0.801cm}{1.328cm}}{\pgfqpoint{0.815cm}{1.363cm}}{\pgfqpoint{0.815cm}{1.399cm}}
\pgfusepath{fill}
\pgfpathmoveto{\pgfqpoint{1.345cm}{1.371cm}}
\pgfpathcurveto{\pgfqpoint{1.345cm}{1.408cm}}{\pgfqpoint{1.331cm}{1.442cm}}{\pgfqpoint{1.305cm}{1.468cm}}
\pgfpathcurveto{\pgfqpoint{1.28cm}{1.494cm}}{\pgfqpoint{1.245cm}{1.508cm}}{\pgfqpoint{1.209cm}{1.508cm}}
\pgfpathcurveto{\pgfqpoint{1.172cm}{1.508cm}}{\pgfqpoint{1.138cm}{1.494cm}}{\pgfqpoint{1.112cm}{1.468cm}}
\pgfpathcurveto{\pgfqpoint{1.087cm}{1.442cm}}{\pgfqpoint{1.072cm}{1.408cm}}{\pgfqpoint{1.072cm}{1.371cm}}
\pgfpathcurveto{\pgfqpoint{1.072cm}{1.335cm}}{\pgfqpoint{1.087cm}{1.3cm}}{\pgfqpoint{1.112cm}{1.274cm}}
\pgfpathcurveto{\pgfqpoint{1.138cm}{1.249cm}}{\pgfqpoint{1.172cm}{1.234cm}}{\pgfqpoint{1.209cm}{1.234cm}}
\pgfpathcurveto{\pgfqpoint{1.245cm}{1.234cm}}{\pgfqpoint{1.28cm}{1.249cm}}{\pgfqpoint{1.305cm}{1.274cm}}
\pgfpathcurveto{\pgfqpoint{1.331cm}{1.3cm}}{\pgfqpoint{1.345cm}{1.335cm}}{\pgfqpoint{1.345cm}{1.371cm}}
\pgfusepath{fill}
\begin{pgfscope}
\pgfsetdash{}{0cm}
\pgfsetlinewidth{0.818mm}
\pgfsetroundcap
\pgfsetmiterlimit{4.0}
\pgfpathmoveto{\pgfqpoint{0.682cm}{0.671cm}}
\pgfpathlineto{\pgfqpoint{0.682cm}{0.042cm}}
\pgfusepath{stroke}
\end{pgfscope}
\end{pgfscope}
\end{pgfscope}
\end{pgfscope}
\end{tikzpicture}}} + \phi + \psi)^2 + (- X^{\!\resizebox{0.6em}{!}{
\begin{tikzpicture}
\pgfpathmoveto{\pgfqpoint{0cm}{-0.035cm}}
\pgfpathlineto{\pgfqpoint{1.376cm}{-0.035cm}}
\pgfpathlineto{\pgfqpoint{1.376cm}{1.552cm}}
\pgfpathlineto{\pgfqpoint{0cm}{1.552cm}}
\pgfpathclose
\pgfusepath{clip}
\begin{pgfscope}
\begin{pgfscope}
\pgfpathmoveto{\pgfqpoint{0cm}{-0.035cm}}
\pgfpathlineto{\pgfqpoint{1.376cm}{-0.035cm}}
\pgfpathlineto{\pgfqpoint{1.376cm}{1.552cm}}
\pgfpathlineto{\pgfqpoint{0cm}{1.552cm}}
\pgfpathclose
\pgfusepath{clip}
\begin{pgfscope}
\begin{pgfscope}
\pgfsetdash{}{0cm}
\pgfsetlinewidth{0.818mm}
\pgfsetroundcap
\pgfsetroundjoin
\pgfsetmiterlimit{7.0}
\definecolor{eps2pgf_color}{gray}{0}\pgfsetstrokecolor{eps2pgf_color}\pgfsetfillcolor{eps2pgf_color}
\pgfpathmoveto{\pgfqpoint{0.117cm}{1.421cm}}
\pgfpathlineto{\pgfqpoint{0.682cm}{0.671cm}}
\pgfpathlineto{\pgfqpoint{1.246cm}{1.421cm}}
\pgfusepath{stroke}
\end{pgfscope}
\definecolor{eps2pgf_color}{gray}{0}\pgfsetstrokecolor{eps2pgf_color}\pgfsetfillcolor{eps2pgf_color}
\pgfpathmoveto{\pgfqpoint{0.273cm}{1.395cm}}
\pgfpathcurveto{\pgfqpoint{0.273cm}{1.432cm}}{\pgfqpoint{0.259cm}{1.467cm}}{\pgfqpoint{0.233cm}{1.492cm}}
\pgfpathcurveto{\pgfqpoint{0.207cm}{1.518cm}}{\pgfqpoint{0.173cm}{1.532cm}}{\pgfqpoint{0.137cm}{1.532cm}}
\pgfpathcurveto{\pgfqpoint{0.1cm}{1.532cm}}{\pgfqpoint{0.066cm}{1.518cm}}{\pgfqpoint{0.04cm}{1.492cm}}
\pgfpathcurveto{\pgfqpoint{0.014cm}{1.467cm}}{\pgfqpoint{0cm}{1.432cm}}{\pgfqpoint{0cm}{1.395cm}}
\pgfpathcurveto{\pgfqpoint{0cm}{1.359cm}}{\pgfqpoint{0.014cm}{1.324cm}}{\pgfqpoint{0.04cm}{1.299cm}}
\pgfpathcurveto{\pgfqpoint{0.066cm}{1.273cm}}{\pgfqpoint{0.1cm}{1.258cm}}{\pgfqpoint{0.137cm}{1.258cm}}
\pgfpathcurveto{\pgfqpoint{0.173cm}{1.258cm}}{\pgfqpoint{0.207cm}{1.273cm}}{\pgfqpoint{0.233cm}{1.299cm}}
\pgfpathcurveto{\pgfqpoint{0.259cm}{1.324cm}}{\pgfqpoint{0.273cm}{1.359cm}}{\pgfqpoint{0.273cm}{1.395cm}}
\pgfusepath{fill}
\begin{pgfscope}
\pgfsetdash{}{0cm}
\pgfsetlinewidth{0.818mm}
\pgfsetmiterlimit{7.0}
\pgfpathmoveto{\pgfqpoint{0.682cm}{0.671cm}}
\pgfpathlineto{\pgfqpoint{0.679cm}{1.418cm}}
\pgfusepath{stroke}
\end{pgfscope}
\pgfpathmoveto{\pgfqpoint{0.815cm}{1.399cm}}
\pgfpathcurveto{\pgfqpoint{0.815cm}{1.435cm}}{\pgfqpoint{0.801cm}{1.47cm}}{\pgfqpoint{0.775cm}{1.496cm}}
\pgfpathcurveto{\pgfqpoint{0.75cm}{1.521cm}}{\pgfqpoint{0.715cm}{1.536cm}}{\pgfqpoint{0.679cm}{1.536cm}}
\pgfpathcurveto{\pgfqpoint{0.643cm}{1.536cm}}{\pgfqpoint{0.608cm}{1.521cm}}{\pgfqpoint{0.582cm}{1.496cm}}
\pgfpathcurveto{\pgfqpoint{0.557cm}{1.47cm}}{\pgfqpoint{0.542cm}{1.435cm}}{\pgfqpoint{0.542cm}{1.399cm}}
\pgfpathcurveto{\pgfqpoint{0.542cm}{1.363cm}}{\pgfqpoint{0.557cm}{1.328cm}}{\pgfqpoint{0.582cm}{1.302cm}}
\pgfpathcurveto{\pgfqpoint{0.608cm}{1.276cm}}{\pgfqpoint{0.643cm}{1.262cm}}{\pgfqpoint{0.679cm}{1.262cm}}
\pgfpathcurveto{\pgfqpoint{0.715cm}{1.262cm}}{\pgfqpoint{0.75cm}{1.276cm}}{\pgfqpoint{0.775cm}{1.302cm}}
\pgfpathcurveto{\pgfqpoint{0.801cm}{1.328cm}}{\pgfqpoint{0.815cm}{1.363cm}}{\pgfqpoint{0.815cm}{1.399cm}}
\pgfusepath{fill}
\pgfpathmoveto{\pgfqpoint{1.345cm}{1.371cm}}
\pgfpathcurveto{\pgfqpoint{1.345cm}{1.408cm}}{\pgfqpoint{1.331cm}{1.442cm}}{\pgfqpoint{1.305cm}{1.468cm}}
\pgfpathcurveto{\pgfqpoint{1.28cm}{1.494cm}}{\pgfqpoint{1.245cm}{1.508cm}}{\pgfqpoint{1.209cm}{1.508cm}}
\pgfpathcurveto{\pgfqpoint{1.172cm}{1.508cm}}{\pgfqpoint{1.138cm}{1.494cm}}{\pgfqpoint{1.112cm}{1.468cm}}
\pgfpathcurveto{\pgfqpoint{1.087cm}{1.442cm}}{\pgfqpoint{1.072cm}{1.408cm}}{\pgfqpoint{1.072cm}{1.371cm}}
\pgfpathcurveto{\pgfqpoint{1.072cm}{1.335cm}}{\pgfqpoint{1.087cm}{1.3cm}}{\pgfqpoint{1.112cm}{1.274cm}}
\pgfpathcurveto{\pgfqpoint{1.138cm}{1.249cm}}{\pgfqpoint{1.172cm}{1.234cm}}{\pgfqpoint{1.209cm}{1.234cm}}
\pgfpathcurveto{\pgfqpoint{1.245cm}{1.234cm}}{\pgfqpoint{1.28cm}{1.249cm}}{\pgfqpoint{1.305cm}{1.274cm}}
\pgfpathcurveto{\pgfqpoint{1.331cm}{1.3cm}}{\pgfqpoint{1.345cm}{1.335cm}}{\pgfqpoint{1.345cm}{1.371cm}}
\pgfusepath{fill}
\begin{pgfscope}
\pgfsetdash{}{0cm}
\pgfsetlinewidth{0.818mm}
\pgfsetroundcap
\pgfsetmiterlimit{4.0}
\pgfpathmoveto{\pgfqpoint{0.682cm}{0.671cm}}
\pgfpathlineto{\pgfqpoint{0.682cm}{0.042cm}}
\pgfusepath{stroke}
\end{pgfscope}
\end{pgfscope}
\end{pgfscope}
\end{pgfscope}
\end{tikzpicture}}} + \phi +
      \psi) (- X^{\!\resizebox{0.6em}{!}{
\begin{tikzpicture}
\pgfpathmoveto{\pgfqpoint{0cm}{-0.035cm}}
\pgfpathlineto{\pgfqpoint{1.376cm}{-0.035cm}}
\pgfpathlineto{\pgfqpoint{1.376cm}{1.552cm}}
\pgfpathlineto{\pgfqpoint{0cm}{1.552cm}}
\pgfpathclose
\pgfusepath{clip}
\begin{pgfscope}
\begin{pgfscope}
\pgfpathmoveto{\pgfqpoint{0cm}{-0.035cm}}
\pgfpathlineto{\pgfqpoint{1.376cm}{-0.035cm}}
\pgfpathlineto{\pgfqpoint{1.376cm}{1.552cm}}
\pgfpathlineto{\pgfqpoint{0cm}{1.552cm}}
\pgfpathclose
\pgfusepath{clip}
\begin{pgfscope}
\begin{pgfscope}
\pgfsetdash{}{0cm}
\pgfsetlinewidth{0.818mm}
\pgfsetroundcap
\pgfsetroundjoin
\pgfsetmiterlimit{7.0}
\definecolor{eps2pgf_color}{gray}{0}\pgfsetstrokecolor{eps2pgf_color}\pgfsetfillcolor{eps2pgf_color}
\pgfpathmoveto{\pgfqpoint{0.117cm}{1.421cm}}
\pgfpathlineto{\pgfqpoint{0.682cm}{0.671cm}}
\pgfpathlineto{\pgfqpoint{1.246cm}{1.421cm}}
\pgfusepath{stroke}
\end{pgfscope}
\definecolor{eps2pgf_color}{gray}{0}\pgfsetstrokecolor{eps2pgf_color}\pgfsetfillcolor{eps2pgf_color}
\pgfpathmoveto{\pgfqpoint{0.273cm}{1.395cm}}
\pgfpathcurveto{\pgfqpoint{0.273cm}{1.432cm}}{\pgfqpoint{0.259cm}{1.467cm}}{\pgfqpoint{0.233cm}{1.492cm}}
\pgfpathcurveto{\pgfqpoint{0.207cm}{1.518cm}}{\pgfqpoint{0.173cm}{1.532cm}}{\pgfqpoint{0.137cm}{1.532cm}}
\pgfpathcurveto{\pgfqpoint{0.1cm}{1.532cm}}{\pgfqpoint{0.066cm}{1.518cm}}{\pgfqpoint{0.04cm}{1.492cm}}
\pgfpathcurveto{\pgfqpoint{0.014cm}{1.467cm}}{\pgfqpoint{0cm}{1.432cm}}{\pgfqpoint{0cm}{1.395cm}}
\pgfpathcurveto{\pgfqpoint{0cm}{1.359cm}}{\pgfqpoint{0.014cm}{1.324cm}}{\pgfqpoint{0.04cm}{1.299cm}}
\pgfpathcurveto{\pgfqpoint{0.066cm}{1.273cm}}{\pgfqpoint{0.1cm}{1.258cm}}{\pgfqpoint{0.137cm}{1.258cm}}
\pgfpathcurveto{\pgfqpoint{0.173cm}{1.258cm}}{\pgfqpoint{0.207cm}{1.273cm}}{\pgfqpoint{0.233cm}{1.299cm}}
\pgfpathcurveto{\pgfqpoint{0.259cm}{1.324cm}}{\pgfqpoint{0.273cm}{1.359cm}}{\pgfqpoint{0.273cm}{1.395cm}}
\pgfusepath{fill}
\begin{pgfscope}
\pgfsetdash{}{0cm}
\pgfsetlinewidth{0.818mm}
\pgfsetmiterlimit{7.0}
\pgfpathmoveto{\pgfqpoint{0.682cm}{0.671cm}}
\pgfpathlineto{\pgfqpoint{0.679cm}{1.418cm}}
\pgfusepath{stroke}
\end{pgfscope}
\pgfpathmoveto{\pgfqpoint{0.815cm}{1.399cm}}
\pgfpathcurveto{\pgfqpoint{0.815cm}{1.435cm}}{\pgfqpoint{0.801cm}{1.47cm}}{\pgfqpoint{0.775cm}{1.496cm}}
\pgfpathcurveto{\pgfqpoint{0.75cm}{1.521cm}}{\pgfqpoint{0.715cm}{1.536cm}}{\pgfqpoint{0.679cm}{1.536cm}}
\pgfpathcurveto{\pgfqpoint{0.643cm}{1.536cm}}{\pgfqpoint{0.608cm}{1.521cm}}{\pgfqpoint{0.582cm}{1.496cm}}
\pgfpathcurveto{\pgfqpoint{0.557cm}{1.47cm}}{\pgfqpoint{0.542cm}{1.435cm}}{\pgfqpoint{0.542cm}{1.399cm}}
\pgfpathcurveto{\pgfqpoint{0.542cm}{1.363cm}}{\pgfqpoint{0.557cm}{1.328cm}}{\pgfqpoint{0.582cm}{1.302cm}}
\pgfpathcurveto{\pgfqpoint{0.608cm}{1.276cm}}{\pgfqpoint{0.643cm}{1.262cm}}{\pgfqpoint{0.679cm}{1.262cm}}
\pgfpathcurveto{\pgfqpoint{0.715cm}{1.262cm}}{\pgfqpoint{0.75cm}{1.276cm}}{\pgfqpoint{0.775cm}{1.302cm}}
\pgfpathcurveto{\pgfqpoint{0.801cm}{1.328cm}}{\pgfqpoint{0.815cm}{1.363cm}}{\pgfqpoint{0.815cm}{1.399cm}}
\pgfusepath{fill}
\pgfpathmoveto{\pgfqpoint{1.345cm}{1.371cm}}
\pgfpathcurveto{\pgfqpoint{1.345cm}{1.408cm}}{\pgfqpoint{1.331cm}{1.442cm}}{\pgfqpoint{1.305cm}{1.468cm}}
\pgfpathcurveto{\pgfqpoint{1.28cm}{1.494cm}}{\pgfqpoint{1.245cm}{1.508cm}}{\pgfqpoint{1.209cm}{1.508cm}}
\pgfpathcurveto{\pgfqpoint{1.172cm}{1.508cm}}{\pgfqpoint{1.138cm}{1.494cm}}{\pgfqpoint{1.112cm}{1.468cm}}
\pgfpathcurveto{\pgfqpoint{1.087cm}{1.442cm}}{\pgfqpoint{1.072cm}{1.408cm}}{\pgfqpoint{1.072cm}{1.371cm}}
\pgfpathcurveto{\pgfqpoint{1.072cm}{1.335cm}}{\pgfqpoint{1.087cm}{1.3cm}}{\pgfqpoint{1.112cm}{1.274cm}}
\pgfpathcurveto{\pgfqpoint{1.138cm}{1.249cm}}{\pgfqpoint{1.172cm}{1.234cm}}{\pgfqpoint{1.209cm}{1.234cm}}
\pgfpathcurveto{\pgfqpoint{1.245cm}{1.234cm}}{\pgfqpoint{1.28cm}{1.249cm}}{\pgfqpoint{1.305cm}{1.274cm}}
\pgfpathcurveto{\pgfqpoint{1.331cm}{1.3cm}}{\pgfqpoint{1.345cm}{1.335cm}}{\pgfqpoint{1.345cm}{1.371cm}}
\pgfusepath{fill}
\begin{pgfscope}
\pgfsetdash{}{0cm}
\pgfsetlinewidth{0.818mm}
\pgfsetroundcap
\pgfsetmiterlimit{4.0}
\pgfpathmoveto{\pgfqpoint{0.682cm}{0.671cm}}
\pgfpathlineto{\pgfqpoint{0.682cm}{0.042cm}}
\pgfusepath{stroke}
\end{pgfscope}
\end{pgfscope}
\end{pgfscope}
\end{pgfscope}
\end{tikzpicture}}} + \tilde{\phi} + \tilde{\psi}) + (- X^{\!\resizebox{0.6em}{!}{
\begin{tikzpicture}
\pgfpathmoveto{\pgfqpoint{0cm}{-0.035cm}}
\pgfpathlineto{\pgfqpoint{1.376cm}{-0.035cm}}
\pgfpathlineto{\pgfqpoint{1.376cm}{1.552cm}}
\pgfpathlineto{\pgfqpoint{0cm}{1.552cm}}
\pgfpathclose
\pgfusepath{clip}
\begin{pgfscope}
\begin{pgfscope}
\pgfpathmoveto{\pgfqpoint{0cm}{-0.035cm}}
\pgfpathlineto{\pgfqpoint{1.376cm}{-0.035cm}}
\pgfpathlineto{\pgfqpoint{1.376cm}{1.552cm}}
\pgfpathlineto{\pgfqpoint{0cm}{1.552cm}}
\pgfpathclose
\pgfusepath{clip}
\begin{pgfscope}
\begin{pgfscope}
\pgfsetdash{}{0cm}
\pgfsetlinewidth{0.818mm}
\pgfsetroundcap
\pgfsetroundjoin
\pgfsetmiterlimit{7.0}
\definecolor{eps2pgf_color}{gray}{0}\pgfsetstrokecolor{eps2pgf_color}\pgfsetfillcolor{eps2pgf_color}
\pgfpathmoveto{\pgfqpoint{0.117cm}{1.421cm}}
\pgfpathlineto{\pgfqpoint{0.682cm}{0.671cm}}
\pgfpathlineto{\pgfqpoint{1.246cm}{1.421cm}}
\pgfusepath{stroke}
\end{pgfscope}
\definecolor{eps2pgf_color}{gray}{0}\pgfsetstrokecolor{eps2pgf_color}\pgfsetfillcolor{eps2pgf_color}
\pgfpathmoveto{\pgfqpoint{0.273cm}{1.395cm}}
\pgfpathcurveto{\pgfqpoint{0.273cm}{1.432cm}}{\pgfqpoint{0.259cm}{1.467cm}}{\pgfqpoint{0.233cm}{1.492cm}}
\pgfpathcurveto{\pgfqpoint{0.207cm}{1.518cm}}{\pgfqpoint{0.173cm}{1.532cm}}{\pgfqpoint{0.137cm}{1.532cm}}
\pgfpathcurveto{\pgfqpoint{0.1cm}{1.532cm}}{\pgfqpoint{0.066cm}{1.518cm}}{\pgfqpoint{0.04cm}{1.492cm}}
\pgfpathcurveto{\pgfqpoint{0.014cm}{1.467cm}}{\pgfqpoint{0cm}{1.432cm}}{\pgfqpoint{0cm}{1.395cm}}
\pgfpathcurveto{\pgfqpoint{0cm}{1.359cm}}{\pgfqpoint{0.014cm}{1.324cm}}{\pgfqpoint{0.04cm}{1.299cm}}
\pgfpathcurveto{\pgfqpoint{0.066cm}{1.273cm}}{\pgfqpoint{0.1cm}{1.258cm}}{\pgfqpoint{0.137cm}{1.258cm}}
\pgfpathcurveto{\pgfqpoint{0.173cm}{1.258cm}}{\pgfqpoint{0.207cm}{1.273cm}}{\pgfqpoint{0.233cm}{1.299cm}}
\pgfpathcurveto{\pgfqpoint{0.259cm}{1.324cm}}{\pgfqpoint{0.273cm}{1.359cm}}{\pgfqpoint{0.273cm}{1.395cm}}
\pgfusepath{fill}
\begin{pgfscope}
\pgfsetdash{}{0cm}
\pgfsetlinewidth{0.818mm}
\pgfsetmiterlimit{7.0}
\pgfpathmoveto{\pgfqpoint{0.682cm}{0.671cm}}
\pgfpathlineto{\pgfqpoint{0.679cm}{1.418cm}}
\pgfusepath{stroke}
\end{pgfscope}
\pgfpathmoveto{\pgfqpoint{0.815cm}{1.399cm}}
\pgfpathcurveto{\pgfqpoint{0.815cm}{1.435cm}}{\pgfqpoint{0.801cm}{1.47cm}}{\pgfqpoint{0.775cm}{1.496cm}}
\pgfpathcurveto{\pgfqpoint{0.75cm}{1.521cm}}{\pgfqpoint{0.715cm}{1.536cm}}{\pgfqpoint{0.679cm}{1.536cm}}
\pgfpathcurveto{\pgfqpoint{0.643cm}{1.536cm}}{\pgfqpoint{0.608cm}{1.521cm}}{\pgfqpoint{0.582cm}{1.496cm}}
\pgfpathcurveto{\pgfqpoint{0.557cm}{1.47cm}}{\pgfqpoint{0.542cm}{1.435cm}}{\pgfqpoint{0.542cm}{1.399cm}}
\pgfpathcurveto{\pgfqpoint{0.542cm}{1.363cm}}{\pgfqpoint{0.557cm}{1.328cm}}{\pgfqpoint{0.582cm}{1.302cm}}
\pgfpathcurveto{\pgfqpoint{0.608cm}{1.276cm}}{\pgfqpoint{0.643cm}{1.262cm}}{\pgfqpoint{0.679cm}{1.262cm}}
\pgfpathcurveto{\pgfqpoint{0.715cm}{1.262cm}}{\pgfqpoint{0.75cm}{1.276cm}}{\pgfqpoint{0.775cm}{1.302cm}}
\pgfpathcurveto{\pgfqpoint{0.801cm}{1.328cm}}{\pgfqpoint{0.815cm}{1.363cm}}{\pgfqpoint{0.815cm}{1.399cm}}
\pgfusepath{fill}
\pgfpathmoveto{\pgfqpoint{1.345cm}{1.371cm}}
\pgfpathcurveto{\pgfqpoint{1.345cm}{1.408cm}}{\pgfqpoint{1.331cm}{1.442cm}}{\pgfqpoint{1.305cm}{1.468cm}}
\pgfpathcurveto{\pgfqpoint{1.28cm}{1.494cm}}{\pgfqpoint{1.245cm}{1.508cm}}{\pgfqpoint{1.209cm}{1.508cm}}
\pgfpathcurveto{\pgfqpoint{1.172cm}{1.508cm}}{\pgfqpoint{1.138cm}{1.494cm}}{\pgfqpoint{1.112cm}{1.468cm}}
\pgfpathcurveto{\pgfqpoint{1.087cm}{1.442cm}}{\pgfqpoint{1.072cm}{1.408cm}}{\pgfqpoint{1.072cm}{1.371cm}}
\pgfpathcurveto{\pgfqpoint{1.072cm}{1.335cm}}{\pgfqpoint{1.087cm}{1.3cm}}{\pgfqpoint{1.112cm}{1.274cm}}
\pgfpathcurveto{\pgfqpoint{1.138cm}{1.249cm}}{\pgfqpoint{1.172cm}{1.234cm}}{\pgfqpoint{1.209cm}{1.234cm}}
\pgfpathcurveto{\pgfqpoint{1.245cm}{1.234cm}}{\pgfqpoint{1.28cm}{1.249cm}}{\pgfqpoint{1.305cm}{1.274cm}}
\pgfpathcurveto{\pgfqpoint{1.331cm}{1.3cm}}{\pgfqpoint{1.345cm}{1.335cm}}{\pgfqpoint{1.345cm}{1.371cm}}
\pgfusepath{fill}
\begin{pgfscope}
\pgfsetdash{}{0cm}
\pgfsetlinewidth{0.818mm}
\pgfsetroundcap
\pgfsetmiterlimit{4.0}
\pgfpathmoveto{\pgfqpoint{0.682cm}{0.671cm}}
\pgfpathlineto{\pgfqpoint{0.682cm}{0.042cm}}
\pgfusepath{stroke}
\end{pgfscope}
\end{pgfscope}
\end{pgfscope}
\end{pgfscope}
\end{tikzpicture}}}
      + \tilde{\phi} + \tilde{\psi})^2] .
  \end{align*}
  From the estimates of the solutions from Theorem \ref{thm:ex3d} we obtain $\Upsilon \in \CC^{-1/2- \kappa}
  (\rho^{\sigma})$. However, we stress that the term $3 \llbracket X^2
  \rrbracket \circ \zeta + 3 b \zeta$ is to be understood in the following sense
  \[ 3 \llbracket X^2 \rrbracket \circ \zeta + 3 b \zeta = - 9 \llbracket X^2
     \rrbracket \circ (\zeta \prec X^{\!\resizebox{0.6em}{!}{
\begin{tikzpicture}
\pgfpathmoveto{\pgfqpoint{0cm}{0cm}}
\pgfpathlineto{\pgfqpoint{1.376cm}{0cm}}
\pgfpathlineto{\pgfqpoint{1.376cm}{1.588cm}}
\pgfpathlineto{\pgfqpoint{0cm}{1.588cm}}
\pgfpathclose
\pgfusepath{clip}
\begin{pgfscope}
\begin{pgfscope}
\pgfpathmoveto{\pgfqpoint{0cm}{0cm}}
\pgfpathlineto{\pgfqpoint{1.376cm}{0cm}}
\pgfpathlineto{\pgfqpoint{1.376cm}{1.588cm}}
\pgfpathlineto{\pgfqpoint{0cm}{1.588cm}}
\pgfpathclose
\pgfusepath{clip}
\begin{pgfscope}
\begin{pgfscope}
\definecolor{eps2pgf_color}{gray}{0.976471}\pgfsetstrokecolor{eps2pgf_color}\pgfsetfillcolor{eps2pgf_color}
\pgfpathmoveto{\pgfqpoint{0cm}{0cm}}
\pgfpathlineto{\pgfqpoint{1.376cm}{0cm}}
\pgfpathlineto{\pgfqpoint{1.376cm}{1.588cm}}
\pgfpathlineto{\pgfqpoint{0cm}{1.588cm}}
\pgfpathclose
\pgfusepath{fill}
\end{pgfscope}
\begin{pgfscope}
\pgfsetdash{}{0cm}
\pgfsetlinewidth{0.818mm}
\pgfsetroundcap
\pgfsetroundjoin
\pgfsetmiterlimit{7.0}
\definecolor{eps2pgf_color}{gray}{0}\pgfsetstrokecolor{eps2pgf_color}\pgfsetfillcolor{eps2pgf_color}
\pgfpathmoveto{\pgfqpoint{0.117cm}{1.476cm}}
\pgfpathlineto{\pgfqpoint{0.682cm}{0.726cm}}
\pgfpathlineto{\pgfqpoint{1.246cm}{1.476cm}}
\pgfusepath{stroke}
\end{pgfscope}
\definecolor{eps2pgf_color}{gray}{0}\pgfsetstrokecolor{eps2pgf_color}\pgfsetfillcolor{eps2pgf_color}
\pgfpathmoveto{\pgfqpoint{0.273cm}{1.451cm}}
\pgfpathcurveto{\pgfqpoint{0.273cm}{1.487cm}}{\pgfqpoint{0.259cm}{1.522cm}}{\pgfqpoint{0.233cm}{1.547cm}}
\pgfpathcurveto{\pgfqpoint{0.207cm}{1.573cm}}{\pgfqpoint{0.173cm}{1.588cm}}{\pgfqpoint{0.137cm}{1.588cm}}
\pgfpathcurveto{\pgfqpoint{0.1cm}{1.588cm}}{\pgfqpoint{0.066cm}{1.573cm}}{\pgfqpoint{0.04cm}{1.547cm}}
\pgfpathcurveto{\pgfqpoint{0.014cm}{1.522cm}}{\pgfqpoint{0cm}{1.487cm}}{\pgfqpoint{0cm}{1.451cm}}
\pgfpathcurveto{\pgfqpoint{0cm}{1.414cm}}{\pgfqpoint{0.014cm}{1.379cm}}{\pgfqpoint{0.04cm}{1.354cm}}
\pgfpathcurveto{\pgfqpoint{0.066cm}{1.328cm}}{\pgfqpoint{0.1cm}{1.314cm}}{\pgfqpoint{0.137cm}{1.314cm}}
\pgfpathcurveto{\pgfqpoint{0.173cm}{1.314cm}}{\pgfqpoint{0.207cm}{1.328cm}}{\pgfqpoint{0.233cm}{1.354cm}}
\pgfpathcurveto{\pgfqpoint{0.259cm}{1.379cm}}{\pgfqpoint{0.273cm}{1.414cm}}{\pgfqpoint{0.273cm}{1.451cm}}
\pgfusepath{fill}
\pgfpathmoveto{\pgfqpoint{1.345cm}{1.426cm}}
\pgfpathcurveto{\pgfqpoint{1.345cm}{1.463cm}}{\pgfqpoint{1.331cm}{1.497cm}}{\pgfqpoint{1.305cm}{1.523cm}}
\pgfpathcurveto{\pgfqpoint{1.28cm}{1.549cm}}{\pgfqpoint{1.245cm}{1.563cm}}{\pgfqpoint{1.209cm}{1.563cm}}
\pgfpathcurveto{\pgfqpoint{1.172cm}{1.563cm}}{\pgfqpoint{1.138cm}{1.549cm}}{\pgfqpoint{1.112cm}{1.523cm}}
\pgfpathcurveto{\pgfqpoint{1.087cm}{1.497cm}}{\pgfqpoint{1.072cm}{1.463cm}}{\pgfqpoint{1.072cm}{1.426cm}}
\pgfpathcurveto{\pgfqpoint{1.072cm}{1.39cm}}{\pgfqpoint{1.087cm}{1.355cm}}{\pgfqpoint{1.112cm}{1.329cm}}
\pgfpathcurveto{\pgfqpoint{1.138cm}{1.304cm}}{\pgfqpoint{1.172cm}{1.289cm}}{\pgfqpoint{1.209cm}{1.289cm}}
\pgfpathcurveto{\pgfqpoint{1.245cm}{1.289cm}}{\pgfqpoint{1.28cm}{1.304cm}}{\pgfqpoint{1.305cm}{1.329cm}}
\pgfpathcurveto{\pgfqpoint{1.331cm}{1.355cm}}{\pgfqpoint{1.345cm}{1.39cm}}{\pgfqpoint{1.345cm}{1.426cm}}
\pgfusepath{fill}
\begin{pgfscope}
\pgfsetdash{}{0cm}
\pgfsetlinewidth{0.818mm}
\pgfsetroundcap
\pgfsetmiterlimit{4.0}
\pgfpathmoveto{\pgfqpoint{0.682cm}{0.726cm}}
\pgfpathlineto{\pgfqpoint{0.682cm}{0.097cm}}
\pgfusepath{stroke}
\end{pgfscope}
\end{pgfscope}
\end{pgfscope}
\end{pgfscope}
\end{tikzpicture}}}) + 3 b \zeta + 3 \llbracket
     X^2 \rrbracket \circ \eta \]
  \[ =  - 9 \zeta \left( \llbracket X^2
     \rrbracket \circ X^{\!\resizebox{0.6em}{!}{
\begin{tikzpicture}
\pgfpathmoveto{\pgfqpoint{0cm}{0cm}}
\pgfpathlineto{\pgfqpoint{1.376cm}{0cm}}
\pgfpathlineto{\pgfqpoint{1.376cm}{1.588cm}}
\pgfpathlineto{\pgfqpoint{0cm}{1.588cm}}
\pgfpathclose
\pgfusepath{clip}
\begin{pgfscope}
\begin{pgfscope}
\pgfpathmoveto{\pgfqpoint{0cm}{0cm}}
\pgfpathlineto{\pgfqpoint{1.376cm}{0cm}}
\pgfpathlineto{\pgfqpoint{1.376cm}{1.588cm}}
\pgfpathlineto{\pgfqpoint{0cm}{1.588cm}}
\pgfpathclose
\pgfusepath{clip}
\begin{pgfscope}
\begin{pgfscope}
\definecolor{eps2pgf_color}{gray}{0.976471}\pgfsetstrokecolor{eps2pgf_color}\pgfsetfillcolor{eps2pgf_color}
\pgfpathmoveto{\pgfqpoint{0cm}{0cm}}
\pgfpathlineto{\pgfqpoint{1.376cm}{0cm}}
\pgfpathlineto{\pgfqpoint{1.376cm}{1.588cm}}
\pgfpathlineto{\pgfqpoint{0cm}{1.588cm}}
\pgfpathclose
\pgfusepath{fill}
\end{pgfscope}
\begin{pgfscope}
\pgfsetdash{}{0cm}
\pgfsetlinewidth{0.818mm}
\pgfsetroundcap
\pgfsetroundjoin
\pgfsetmiterlimit{7.0}
\definecolor{eps2pgf_color}{gray}{0}\pgfsetstrokecolor{eps2pgf_color}\pgfsetfillcolor{eps2pgf_color}
\pgfpathmoveto{\pgfqpoint{0.117cm}{1.476cm}}
\pgfpathlineto{\pgfqpoint{0.682cm}{0.726cm}}
\pgfpathlineto{\pgfqpoint{1.246cm}{1.476cm}}
\pgfusepath{stroke}
\end{pgfscope}
\definecolor{eps2pgf_color}{gray}{0}\pgfsetstrokecolor{eps2pgf_color}\pgfsetfillcolor{eps2pgf_color}
\pgfpathmoveto{\pgfqpoint{0.273cm}{1.451cm}}
\pgfpathcurveto{\pgfqpoint{0.273cm}{1.487cm}}{\pgfqpoint{0.259cm}{1.522cm}}{\pgfqpoint{0.233cm}{1.547cm}}
\pgfpathcurveto{\pgfqpoint{0.207cm}{1.573cm}}{\pgfqpoint{0.173cm}{1.588cm}}{\pgfqpoint{0.137cm}{1.588cm}}
\pgfpathcurveto{\pgfqpoint{0.1cm}{1.588cm}}{\pgfqpoint{0.066cm}{1.573cm}}{\pgfqpoint{0.04cm}{1.547cm}}
\pgfpathcurveto{\pgfqpoint{0.014cm}{1.522cm}}{\pgfqpoint{0cm}{1.487cm}}{\pgfqpoint{0cm}{1.451cm}}
\pgfpathcurveto{\pgfqpoint{0cm}{1.414cm}}{\pgfqpoint{0.014cm}{1.379cm}}{\pgfqpoint{0.04cm}{1.354cm}}
\pgfpathcurveto{\pgfqpoint{0.066cm}{1.328cm}}{\pgfqpoint{0.1cm}{1.314cm}}{\pgfqpoint{0.137cm}{1.314cm}}
\pgfpathcurveto{\pgfqpoint{0.173cm}{1.314cm}}{\pgfqpoint{0.207cm}{1.328cm}}{\pgfqpoint{0.233cm}{1.354cm}}
\pgfpathcurveto{\pgfqpoint{0.259cm}{1.379cm}}{\pgfqpoint{0.273cm}{1.414cm}}{\pgfqpoint{0.273cm}{1.451cm}}
\pgfusepath{fill}
\pgfpathmoveto{\pgfqpoint{1.345cm}{1.426cm}}
\pgfpathcurveto{\pgfqpoint{1.345cm}{1.463cm}}{\pgfqpoint{1.331cm}{1.497cm}}{\pgfqpoint{1.305cm}{1.523cm}}
\pgfpathcurveto{\pgfqpoint{1.28cm}{1.549cm}}{\pgfqpoint{1.245cm}{1.563cm}}{\pgfqpoint{1.209cm}{1.563cm}}
\pgfpathcurveto{\pgfqpoint{1.172cm}{1.563cm}}{\pgfqpoint{1.138cm}{1.549cm}}{\pgfqpoint{1.112cm}{1.523cm}}
\pgfpathcurveto{\pgfqpoint{1.087cm}{1.497cm}}{\pgfqpoint{1.072cm}{1.463cm}}{\pgfqpoint{1.072cm}{1.426cm}}
\pgfpathcurveto{\pgfqpoint{1.072cm}{1.39cm}}{\pgfqpoint{1.087cm}{1.355cm}}{\pgfqpoint{1.112cm}{1.329cm}}
\pgfpathcurveto{\pgfqpoint{1.138cm}{1.304cm}}{\pgfqpoint{1.172cm}{1.289cm}}{\pgfqpoint{1.209cm}{1.289cm}}
\pgfpathcurveto{\pgfqpoint{1.245cm}{1.289cm}}{\pgfqpoint{1.28cm}{1.304cm}}{\pgfqpoint{1.305cm}{1.329cm}}
\pgfpathcurveto{\pgfqpoint{1.331cm}{1.355cm}}{\pgfqpoint{1.345cm}{1.39cm}}{\pgfqpoint{1.345cm}{1.426cm}}
\pgfusepath{fill}
\begin{pgfscope}
\pgfsetdash{}{0cm}
\pgfsetlinewidth{0.818mm}
\pgfsetroundcap
\pgfsetmiterlimit{4.0}
\pgfpathmoveto{\pgfqpoint{0.682cm}{0.726cm}}
\pgfpathlineto{\pgfqpoint{0.682cm}{0.097cm}}
\pgfusepath{stroke}
\end{pgfscope}
\end{pgfscope}
\end{pgfscope}
\end{pgfscope}
\end{tikzpicture}}} - \frac{b}{3} \right) - 9 \tmop{com} (\zeta,
     X^{\!\resizebox{0.6em}{!}{
\begin{tikzpicture}
\pgfpathmoveto{\pgfqpoint{0cm}{0cm}}
\pgfpathlineto{\pgfqpoint{1.376cm}{0cm}}
\pgfpathlineto{\pgfqpoint{1.376cm}{1.588cm}}
\pgfpathlineto{\pgfqpoint{0cm}{1.588cm}}
\pgfpathclose
\pgfusepath{clip}
\begin{pgfscope}
\begin{pgfscope}
\pgfpathmoveto{\pgfqpoint{0cm}{0cm}}
\pgfpathlineto{\pgfqpoint{1.376cm}{0cm}}
\pgfpathlineto{\pgfqpoint{1.376cm}{1.588cm}}
\pgfpathlineto{\pgfqpoint{0cm}{1.588cm}}
\pgfpathclose
\pgfusepath{clip}
\begin{pgfscope}
\begin{pgfscope}
\definecolor{eps2pgf_color}{gray}{0.976471}\pgfsetstrokecolor{eps2pgf_color}\pgfsetfillcolor{eps2pgf_color}
\pgfpathmoveto{\pgfqpoint{0cm}{0cm}}
\pgfpathlineto{\pgfqpoint{1.376cm}{0cm}}
\pgfpathlineto{\pgfqpoint{1.376cm}{1.588cm}}
\pgfpathlineto{\pgfqpoint{0cm}{1.588cm}}
\pgfpathclose
\pgfusepath{fill}
\end{pgfscope}
\begin{pgfscope}
\pgfsetdash{}{0cm}
\pgfsetlinewidth{0.818mm}
\pgfsetroundcap
\pgfsetroundjoin
\pgfsetmiterlimit{7.0}
\definecolor{eps2pgf_color}{gray}{0}\pgfsetstrokecolor{eps2pgf_color}\pgfsetfillcolor{eps2pgf_color}
\pgfpathmoveto{\pgfqpoint{0.117cm}{1.476cm}}
\pgfpathlineto{\pgfqpoint{0.682cm}{0.726cm}}
\pgfpathlineto{\pgfqpoint{1.246cm}{1.476cm}}
\pgfusepath{stroke}
\end{pgfscope}
\definecolor{eps2pgf_color}{gray}{0}\pgfsetstrokecolor{eps2pgf_color}\pgfsetfillcolor{eps2pgf_color}
\pgfpathmoveto{\pgfqpoint{0.273cm}{1.451cm}}
\pgfpathcurveto{\pgfqpoint{0.273cm}{1.487cm}}{\pgfqpoint{0.259cm}{1.522cm}}{\pgfqpoint{0.233cm}{1.547cm}}
\pgfpathcurveto{\pgfqpoint{0.207cm}{1.573cm}}{\pgfqpoint{0.173cm}{1.588cm}}{\pgfqpoint{0.137cm}{1.588cm}}
\pgfpathcurveto{\pgfqpoint{0.1cm}{1.588cm}}{\pgfqpoint{0.066cm}{1.573cm}}{\pgfqpoint{0.04cm}{1.547cm}}
\pgfpathcurveto{\pgfqpoint{0.014cm}{1.522cm}}{\pgfqpoint{0cm}{1.487cm}}{\pgfqpoint{0cm}{1.451cm}}
\pgfpathcurveto{\pgfqpoint{0cm}{1.414cm}}{\pgfqpoint{0.014cm}{1.379cm}}{\pgfqpoint{0.04cm}{1.354cm}}
\pgfpathcurveto{\pgfqpoint{0.066cm}{1.328cm}}{\pgfqpoint{0.1cm}{1.314cm}}{\pgfqpoint{0.137cm}{1.314cm}}
\pgfpathcurveto{\pgfqpoint{0.173cm}{1.314cm}}{\pgfqpoint{0.207cm}{1.328cm}}{\pgfqpoint{0.233cm}{1.354cm}}
\pgfpathcurveto{\pgfqpoint{0.259cm}{1.379cm}}{\pgfqpoint{0.273cm}{1.414cm}}{\pgfqpoint{0.273cm}{1.451cm}}
\pgfusepath{fill}
\pgfpathmoveto{\pgfqpoint{1.345cm}{1.426cm}}
\pgfpathcurveto{\pgfqpoint{1.345cm}{1.463cm}}{\pgfqpoint{1.331cm}{1.497cm}}{\pgfqpoint{1.305cm}{1.523cm}}
\pgfpathcurveto{\pgfqpoint{1.28cm}{1.549cm}}{\pgfqpoint{1.245cm}{1.563cm}}{\pgfqpoint{1.209cm}{1.563cm}}
\pgfpathcurveto{\pgfqpoint{1.172cm}{1.563cm}}{\pgfqpoint{1.138cm}{1.549cm}}{\pgfqpoint{1.112cm}{1.523cm}}
\pgfpathcurveto{\pgfqpoint{1.087cm}{1.497cm}}{\pgfqpoint{1.072cm}{1.463cm}}{\pgfqpoint{1.072cm}{1.426cm}}
\pgfpathcurveto{\pgfqpoint{1.072cm}{1.39cm}}{\pgfqpoint{1.087cm}{1.355cm}}{\pgfqpoint{1.112cm}{1.329cm}}
\pgfpathcurveto{\pgfqpoint{1.138cm}{1.304cm}}{\pgfqpoint{1.172cm}{1.289cm}}{\pgfqpoint{1.209cm}{1.289cm}}
\pgfpathcurveto{\pgfqpoint{1.245cm}{1.289cm}}{\pgfqpoint{1.28cm}{1.304cm}}{\pgfqpoint{1.305cm}{1.329cm}}
\pgfpathcurveto{\pgfqpoint{1.331cm}{1.355cm}}{\pgfqpoint{1.345cm}{1.39cm}}{\pgfqpoint{1.345cm}{1.426cm}}
\pgfusepath{fill}
\begin{pgfscope}
\pgfsetdash{}{0cm}
\pgfsetlinewidth{0.818mm}
\pgfsetroundcap
\pgfsetmiterlimit{4.0}
\pgfpathmoveto{\pgfqpoint{0.682cm}{0.726cm}}
\pgfpathlineto{\pgfqpoint{0.682cm}{0.097cm}}
\pgfusepath{stroke}
\end{pgfscope}
\end{pgfscope}
\end{pgfscope}
\end{pgfscope}
\end{tikzpicture}}}, \llbracket X^2 \rrbracket) + 3 \llbracket X^2 \rrbracket
     \circ \eta, \]
  where $X^{\!\resizebox{!}{.8em}{
\begin{tikzpicture}
\pgfpathmoveto{\pgfqpoint{0cm}{-0.035cm}}
\pgfpathlineto{\pgfqpoint{1.976cm}{-0.035cm}}
\pgfpathlineto{\pgfqpoint{1.976cm}{1.94cm}}
\pgfpathlineto{\pgfqpoint{0cm}{1.94cm}}
\pgfpathclose
\pgfusepath{clip}
\begin{pgfscope}
\begin{pgfscope}
\pgfpathmoveto{\pgfqpoint{0cm}{-0.035cm}}
\pgfpathlineto{\pgfqpoint{1.976cm}{-0.035cm}}
\pgfpathlineto{\pgfqpoint{1.976cm}{1.94cm}}
\pgfpathlineto{\pgfqpoint{0cm}{1.94cm}}
\pgfpathclose
\pgfusepath{clip}
\begin{pgfscope}
\begin{pgfscope}
\pgfsetdash{}{0cm}
\pgfsetlinewidth{0.818mm}
\pgfsetroundcap
\pgfsetroundjoin
\pgfsetmiterlimit{7.0}
\definecolor{eps2pgf_color}{gray}{0}\pgfsetstrokecolor{eps2pgf_color}\pgfsetfillcolor{eps2pgf_color}
\pgfpathmoveto{\pgfqpoint{0.117cm}{1.815cm}}
\pgfpathlineto{\pgfqpoint{0.682cm}{1.065cm}}
\pgfpathlineto{\pgfqpoint{1.246cm}{1.815cm}}
\pgfusepath{stroke}
\end{pgfscope}
\definecolor{eps2pgf_color}{gray}{0}\pgfsetstrokecolor{eps2pgf_color}\pgfsetfillcolor{eps2pgf_color}
\pgfpathmoveto{\pgfqpoint{0.273cm}{1.789cm}}
\pgfpathcurveto{\pgfqpoint{0.273cm}{1.825cm}}{\pgfqpoint{0.259cm}{1.86cm}}{\pgfqpoint{0.233cm}{1.886cm}}
\pgfpathcurveto{\pgfqpoint{0.207cm}{1.912cm}}{\pgfqpoint{0.173cm}{1.926cm}}{\pgfqpoint{0.137cm}{1.926cm}}
\pgfpathcurveto{\pgfqpoint{0.1cm}{1.926cm}}{\pgfqpoint{0.066cm}{1.912cm}}{\pgfqpoint{0.04cm}{1.886cm}}
\pgfpathcurveto{\pgfqpoint{0.014cm}{1.86cm}}{\pgfqpoint{0cm}{1.825cm}}{\pgfqpoint{0cm}{1.789cm}}
\pgfpathcurveto{\pgfqpoint{0cm}{1.753cm}}{\pgfqpoint{0.014cm}{1.718cm}}{\pgfqpoint{0.04cm}{1.692cm}}
\pgfpathcurveto{\pgfqpoint{0.066cm}{1.667cm}}{\pgfqpoint{0.1cm}{1.652cm}}{\pgfqpoint{0.137cm}{1.652cm}}
\pgfpathcurveto{\pgfqpoint{0.173cm}{1.652cm}}{\pgfqpoint{0.207cm}{1.667cm}}{\pgfqpoint{0.233cm}{1.692cm}}
\pgfpathcurveto{\pgfqpoint{0.259cm}{1.718cm}}{\pgfqpoint{0.273cm}{1.753cm}}{\pgfqpoint{0.273cm}{1.789cm}}
\pgfusepath{fill}
\pgfpathmoveto{\pgfqpoint{1.345cm}{1.765cm}}
\pgfpathcurveto{\pgfqpoint{1.345cm}{1.801cm}}{\pgfqpoint{1.331cm}{1.836cm}}{\pgfqpoint{1.305cm}{1.862cm}}
\pgfpathcurveto{\pgfqpoint{1.28cm}{1.887cm}}{\pgfqpoint{1.245cm}{1.902cm}}{\pgfqpoint{1.209cm}{1.902cm}}
\pgfpathcurveto{\pgfqpoint{1.172cm}{1.902cm}}{\pgfqpoint{1.138cm}{1.887cm}}{\pgfqpoint{1.112cm}{1.862cm}}
\pgfpathcurveto{\pgfqpoint{1.087cm}{1.836cm}}{\pgfqpoint{1.072cm}{1.801cm}}{\pgfqpoint{1.072cm}{1.765cm}}
\pgfpathcurveto{\pgfqpoint{1.072cm}{1.728cm}}{\pgfqpoint{1.087cm}{1.694cm}}{\pgfqpoint{1.112cm}{1.668cm}}
\pgfpathcurveto{\pgfqpoint{1.138cm}{1.642cm}}{\pgfqpoint{1.172cm}{1.628cm}}{\pgfqpoint{1.209cm}{1.628cm}}
\pgfpathcurveto{\pgfqpoint{1.245cm}{1.628cm}}{\pgfqpoint{1.28cm}{1.642cm}}{\pgfqpoint{1.305cm}{1.668cm}}
\pgfpathcurveto{\pgfqpoint{1.331cm}{1.694cm}}{\pgfqpoint{1.345cm}{1.728cm}}{\pgfqpoint{1.345cm}{1.765cm}}
\pgfusepath{fill}
\begin{pgfscope}
\pgfsetdash{}{0cm}
\pgfsetlinewidth{0.818mm}
\pgfsetroundcap
\pgfsetroundjoin
\pgfsetmiterlimit{7.0}
\pgfpathmoveto{\pgfqpoint{0.682cm}{1.065cm}}
\pgfpathlineto{\pgfqpoint{1.246cm}{0.315cm}}
\pgfpathlineto{\pgfqpoint{1.811cm}{1.065cm}}
\pgfusepath{stroke}
\end{pgfscope}
\pgfpathmoveto{\pgfqpoint{1.948cm}{1.065cm}}
\pgfpathcurveto{\pgfqpoint{1.948cm}{1.101cm}}{\pgfqpoint{1.933cm}{1.136cm}}{\pgfqpoint{1.907cm}{1.162cm}}
\pgfpathcurveto{\pgfqpoint{1.882cm}{1.187cm}}{\pgfqpoint{1.847cm}{1.202cm}}{\pgfqpoint{1.811cm}{1.202cm}}
\pgfpathcurveto{\pgfqpoint{1.775cm}{1.202cm}}{\pgfqpoint{1.74cm}{1.187cm}}{\pgfqpoint{1.714cm}{1.162cm}}
\pgfpathcurveto{\pgfqpoint{1.689cm}{1.136cm}}{\pgfqpoint{1.674cm}{1.101cm}}{\pgfqpoint{1.674cm}{1.065cm}}
\pgfpathcurveto{\pgfqpoint{1.674cm}{1.029cm}}{\pgfqpoint{1.689cm}{0.994cm}}{\pgfqpoint{1.714cm}{0.968cm}}
\pgfpathcurveto{\pgfqpoint{1.74cm}{0.942cm}}{\pgfqpoint{1.775cm}{0.928cm}}{\pgfqpoint{1.811cm}{0.928cm}}
\pgfpathcurveto{\pgfqpoint{1.847cm}{0.928cm}}{\pgfqpoint{1.882cm}{0.942cm}}{\pgfqpoint{1.907cm}{0.968cm}}
\pgfpathcurveto{\pgfqpoint{1.933cm}{0.994cm}}{\pgfqpoint{1.948cm}{1.029cm}}{\pgfqpoint{1.948cm}{1.065cm}}
\pgfusepath{fill}
\begin{pgfscope}
\pgfsetdash{}{0cm}
\pgfsetlinewidth{0.818mm}
\pgfsetmiterlimit{7.0}
\pgfpathmoveto{\pgfqpoint{1.246cm}{0.315cm}}
\pgfpathlineto{\pgfqpoint{1.244cm}{1.061cm}}
\pgfusepath{stroke}
\end{pgfscope}
\pgfpathmoveto{\pgfqpoint{1.38cm}{1.065cm}}
\pgfpathcurveto{\pgfqpoint{1.38cm}{1.101cm}}{\pgfqpoint{1.366cm}{1.136cm}}{\pgfqpoint{1.34cm}{1.162cm}}
\pgfpathcurveto{\pgfqpoint{1.315cm}{1.187cm}}{\pgfqpoint{1.28cm}{1.202cm}}{\pgfqpoint{1.244cm}{1.202cm}}
\pgfpathcurveto{\pgfqpoint{1.207cm}{1.202cm}}{\pgfqpoint{1.173cm}{1.187cm}}{\pgfqpoint{1.147cm}{1.162cm}}
\pgfpathcurveto{\pgfqpoint{1.121cm}{1.136cm}}{\pgfqpoint{1.107cm}{1.101cm}}{\pgfqpoint{1.107cm}{1.065cm}}
\pgfpathcurveto{\pgfqpoint{1.107cm}{1.029cm}}{\pgfqpoint{1.121cm}{0.994cm}}{\pgfqpoint{1.147cm}{0.968cm}}
\pgfpathcurveto{\pgfqpoint{1.173cm}{0.942cm}}{\pgfqpoint{1.207cm}{0.928cm}}{\pgfqpoint{1.244cm}{0.928cm}}
\pgfpathcurveto{\pgfqpoint{1.28cm}{0.928cm}}{\pgfqpoint{1.315cm}{0.942cm}}{\pgfqpoint{1.34cm}{0.968cm}}
\pgfpathcurveto{\pgfqpoint{1.366cm}{0.994cm}}{\pgfqpoint{1.38cm}{1.029cm}}{\pgfqpoint{1.38cm}{1.065cm}}
\pgfusepath{fill}
\begin{pgfscope}
\pgfsetdash{}{0cm}
\pgfsetlinewidth{0.818mm}
\pgfsetmiterlimit{4.0}
\pgfpathmoveto{\pgfqpoint{1.383cm}{0.178cm}}
\pgfpathcurveto{\pgfqpoint{1.383cm}{0.214cm}}{\pgfqpoint{1.369cm}{0.249cm}}{\pgfqpoint{1.343cm}{0.275cm}}
\pgfpathcurveto{\pgfqpoint{1.317cm}{0.3cm}}{\pgfqpoint{1.283cm}{0.315cm}}{\pgfqpoint{1.246cm}{0.315cm}}
\pgfpathcurveto{\pgfqpoint{1.21cm}{0.315cm}}{\pgfqpoint{1.175cm}{0.3cm}}{\pgfqpoint{1.15cm}{0.275cm}}
\pgfpathcurveto{\pgfqpoint{1.124cm}{0.249cm}}{\pgfqpoint{1.11cm}{0.214cm}}{\pgfqpoint{1.11cm}{0.178cm}}
\pgfpathcurveto{\pgfqpoint{1.11cm}{0.141cm}}{\pgfqpoint{1.124cm}{0.107cm}}{\pgfqpoint{1.15cm}{0.081cm}}
\pgfpathcurveto{\pgfqpoint{1.175cm}{0.055cm}}{\pgfqpoint{1.21cm}{0.041cm}}{\pgfqpoint{1.246cm}{0.041cm}}
\pgfpathcurveto{\pgfqpoint{1.283cm}{0.041cm}}{\pgfqpoint{1.317cm}{0.055cm}}{\pgfqpoint{1.343cm}{0.081cm}}
\pgfpathcurveto{\pgfqpoint{1.369cm}{0.107cm}}{\pgfqpoint{1.383cm}{0.141cm}}{\pgfqpoint{1.383cm}{0.178cm}}
\pgfusepath{stroke}
\end{pgfscope}
\end{pgfscope}
\end{pgfscope}
\end{pgfscope}
\end{tikzpicture}}}=\llbracket X^2 \rrbracket \circ X^{\!\resizebox{0.6em}{!}{
\begin{tikzpicture}
\pgfpathmoveto{\pgfqpoint{0cm}{0cm}}
\pgfpathlineto{\pgfqpoint{1.376cm}{0cm}}
\pgfpathlineto{\pgfqpoint{1.376cm}{1.588cm}}
\pgfpathlineto{\pgfqpoint{0cm}{1.588cm}}
\pgfpathclose
\pgfusepath{clip}
\begin{pgfscope}
\begin{pgfscope}
\pgfpathmoveto{\pgfqpoint{0cm}{0cm}}
\pgfpathlineto{\pgfqpoint{1.376cm}{0cm}}
\pgfpathlineto{\pgfqpoint{1.376cm}{1.588cm}}
\pgfpathlineto{\pgfqpoint{0cm}{1.588cm}}
\pgfpathclose
\pgfusepath{clip}
\begin{pgfscope}
\begin{pgfscope}
\definecolor{eps2pgf_color}{gray}{0.976471}\pgfsetstrokecolor{eps2pgf_color}\pgfsetfillcolor{eps2pgf_color}
\pgfpathmoveto{\pgfqpoint{0cm}{0cm}}
\pgfpathlineto{\pgfqpoint{1.376cm}{0cm}}
\pgfpathlineto{\pgfqpoint{1.376cm}{1.588cm}}
\pgfpathlineto{\pgfqpoint{0cm}{1.588cm}}
\pgfpathclose
\pgfusepath{fill}
\end{pgfscope}
\begin{pgfscope}
\pgfsetdash{}{0cm}
\pgfsetlinewidth{0.818mm}
\pgfsetroundcap
\pgfsetroundjoin
\pgfsetmiterlimit{7.0}
\definecolor{eps2pgf_color}{gray}{0}\pgfsetstrokecolor{eps2pgf_color}\pgfsetfillcolor{eps2pgf_color}
\pgfpathmoveto{\pgfqpoint{0.117cm}{1.476cm}}
\pgfpathlineto{\pgfqpoint{0.682cm}{0.726cm}}
\pgfpathlineto{\pgfqpoint{1.246cm}{1.476cm}}
\pgfusepath{stroke}
\end{pgfscope}
\definecolor{eps2pgf_color}{gray}{0}\pgfsetstrokecolor{eps2pgf_color}\pgfsetfillcolor{eps2pgf_color}
\pgfpathmoveto{\pgfqpoint{0.273cm}{1.451cm}}
\pgfpathcurveto{\pgfqpoint{0.273cm}{1.487cm}}{\pgfqpoint{0.259cm}{1.522cm}}{\pgfqpoint{0.233cm}{1.547cm}}
\pgfpathcurveto{\pgfqpoint{0.207cm}{1.573cm}}{\pgfqpoint{0.173cm}{1.588cm}}{\pgfqpoint{0.137cm}{1.588cm}}
\pgfpathcurveto{\pgfqpoint{0.1cm}{1.588cm}}{\pgfqpoint{0.066cm}{1.573cm}}{\pgfqpoint{0.04cm}{1.547cm}}
\pgfpathcurveto{\pgfqpoint{0.014cm}{1.522cm}}{\pgfqpoint{0cm}{1.487cm}}{\pgfqpoint{0cm}{1.451cm}}
\pgfpathcurveto{\pgfqpoint{0cm}{1.414cm}}{\pgfqpoint{0.014cm}{1.379cm}}{\pgfqpoint{0.04cm}{1.354cm}}
\pgfpathcurveto{\pgfqpoint{0.066cm}{1.328cm}}{\pgfqpoint{0.1cm}{1.314cm}}{\pgfqpoint{0.137cm}{1.314cm}}
\pgfpathcurveto{\pgfqpoint{0.173cm}{1.314cm}}{\pgfqpoint{0.207cm}{1.328cm}}{\pgfqpoint{0.233cm}{1.354cm}}
\pgfpathcurveto{\pgfqpoint{0.259cm}{1.379cm}}{\pgfqpoint{0.273cm}{1.414cm}}{\pgfqpoint{0.273cm}{1.451cm}}
\pgfusepath{fill}
\pgfpathmoveto{\pgfqpoint{1.345cm}{1.426cm}}
\pgfpathcurveto{\pgfqpoint{1.345cm}{1.463cm}}{\pgfqpoint{1.331cm}{1.497cm}}{\pgfqpoint{1.305cm}{1.523cm}}
\pgfpathcurveto{\pgfqpoint{1.28cm}{1.549cm}}{\pgfqpoint{1.245cm}{1.563cm}}{\pgfqpoint{1.209cm}{1.563cm}}
\pgfpathcurveto{\pgfqpoint{1.172cm}{1.563cm}}{\pgfqpoint{1.138cm}{1.549cm}}{\pgfqpoint{1.112cm}{1.523cm}}
\pgfpathcurveto{\pgfqpoint{1.087cm}{1.497cm}}{\pgfqpoint{1.072cm}{1.463cm}}{\pgfqpoint{1.072cm}{1.426cm}}
\pgfpathcurveto{\pgfqpoint{1.072cm}{1.39cm}}{\pgfqpoint{1.087cm}{1.355cm}}{\pgfqpoint{1.112cm}{1.329cm}}
\pgfpathcurveto{\pgfqpoint{1.138cm}{1.304cm}}{\pgfqpoint{1.172cm}{1.289cm}}{\pgfqpoint{1.209cm}{1.289cm}}
\pgfpathcurveto{\pgfqpoint{1.245cm}{1.289cm}}{\pgfqpoint{1.28cm}{1.304cm}}{\pgfqpoint{1.305cm}{1.329cm}}
\pgfpathcurveto{\pgfqpoint{1.331cm}{1.355cm}}{\pgfqpoint{1.345cm}{1.39cm}}{\pgfqpoint{1.345cm}{1.426cm}}
\pgfusepath{fill}
\begin{pgfscope}
\pgfsetdash{}{0cm}
\pgfsetlinewidth{0.818mm}
\pgfsetroundcap
\pgfsetmiterlimit{4.0}
\pgfpathmoveto{\pgfqpoint{0.682cm}{0.726cm}}
\pgfpathlineto{\pgfqpoint{0.682cm}{0.097cm}}
\pgfusepath{stroke}
\end{pgfscope}
\end{pgfscope}
\end{pgfscope}
\end{pgfscope}
\end{tikzpicture}}} - \frac{b}{3}$ is the
  renormalized resonant product in $\CC^{- \kappa} (\rho^{\sigma})$.
  
  Now, we introduce a time dependent weight $\pi (t, x) = \exp (- t \rho^{- 2
  b} (x))$ where $\rho (x) = \langle x \rangle^{- 1}$ is a polynomial weight
  and $b \in (0, 1 / 2)$. Let $\beta, \gamma \in (0, 1)$ to be chosen below.
  First, we use the fact that $\Delta_k \LL = \LL \Delta_k$ in order to derive
  the equation for the Littlewood--Paley block $\Delta_k \eta$. Now, we test
  this equation by $\pi^2 \Delta_k \eta$ and use $\partial_t \pi = - \pi
  \rho^{- 2 b}$ to get
  \begin{equation}
    \frac{1}{2} \partial_t \| \Delta_k \eta \|_{L^2 (\pi)}^2 + \| \nabla
    \Delta_k \eta \|_{L^2 (\pi)}^2 + \mu \| \Delta_k \eta \|_{L^2 (\pi)}^2 +
    \| \rho^{- b} \Delta_k \eta \|_{L^2 (\pi)}^2 \label{eq:eta}
  \end{equation}
  \[ = \left\langle \pi^{} \Delta_k \eta, \pi \Delta_k \left( \LL \eta \right)
     \right\rangle - 2 \langle \pi \Delta_k \eta, \frac{\nabla \pi}{\pi} \pi
     \nabla \Delta_k \eta \rangle . \]
  Since $\nabla \pi = \pi t 2 b \rho^{- 2 b - 1} \nabla \rho$ and $| \nabla
  \rho / \rho^2 | \lesssim 1$, we obtain $\left| \frac{\nabla \pi}{\pi}
  \right| \lesssim t \rho^{1 - 2 b} \lesssim_T 1$ as a consequence of $b \in
  (0, 1 / 2)$ and $t \in [0, T]$. Hence by Young's inequality
  \[ | \langle \pi \Delta_k \eta, \frac{\nabla \pi}{\pi} \pi \nabla \Delta_k
     \eta \rangle | \leqslant C_{T, \delta} \|  \Delta_k \eta
     \|_{L^2 (\pi)}^2 + \delta \| \nabla \Delta_k \eta \|_{L^2 (\pi)}^2 . \]
  Now we estimate by duality, for some $\gamma \in (0, 1)$ and some $0 < a < b
  / 2$,
  \[ \left| \left\langle \pi^{} \Delta_k \eta, \pi \Delta_k \left( \LL \eta
     \right) \right\rangle \right| \lesssim 2^{- 2 \beta k} 2^{(2 \beta +
     \gamma) k} \| \Delta_k \eta \|_{L^2 (\pi \rho^{- a})} 2^{- \gamma k}
     \left\| \Delta_k \left( \LL \eta \right) \right\|_{L^2 (\pi \rho^a)} \]
  \[ \leqslant 2^{- 2 \beta k} \left( C_{\delta} 2^{2 (2 \beta + \gamma) k} \|
     \Delta_k \eta \|^2_{L^2 (\pi \rho^{- a})} + \delta 2^{- 2 \gamma k}
     \left\| \Delta_k \left( \LL \eta \right) \right\|^2_{L^2 (\pi \rho^a)}
     \right) . \]
 \rmbb{ Moreover, by a suitable choice of $a \in (0, b / 2)$ and $\beta, \gamma \in
  (0, 1)$ such that $\beta < 2 \beta + \gamma < \beta + 1$ (hence $\beta +
  \gamma < 1$), we may interpolate as follows. Let $\lambda\in (0,1)$ such that $a=(1-\lambda) b/2$ and $2\beta+\gamma=(1-\lambda)\beta+\lambda(\beta+1)$. Then H\"older's, Bernstein's and Young's  inequalities imply
  \[ 2^{2 (2 \beta + \gamma) k} \| \Delta_k \eta \|^2_{L^2 (\pi \rho^{-
     a})} \leqslant 2^{2(1-\lambda)\beta k} 2^{2\lambda(\beta+1)k} \|\rho^{-(1-\lambda)b/2} |\Delta_k \eta|^{1-\lambda} |\Delta_k \eta|^{\lambda} \|^2_{L^2 (\pi )}
     \]
\[
\leqslant 2^{2 ( 1-\lambda ) \beta k}  \| \rho^{- b/2} \Delta_k \eta \|^{2(1-\lambda)}_{ L^2 (\pi ) }
	2^{ 2 \lambda ( \beta+1 ) k}\|  \Delta_k \eta \|^{2\lambda}_{ L^2 (\pi ) }
\] 
\[
\leqslant 2^{2 ( 1-\lambda ) \beta k}  \| \rho^{- b/2} \Delta_k \eta \|^{2(1-\lambda)}_{ L^2 (\pi ) }
	2^{ 2 \lambda \beta k}\| \nabla \Delta_k \eta \|^{2\lambda}_{ L^2 (\pi ) }
\]     
\[
     \leqslant C_{\delta} 2^{2 \beta k} \| \rho^{- b / 2} \Delta_k
     \eta \|^2_{L^2 (\pi)} + \delta 2^{2 \beta k} \| \nabla \Delta_k \eta
     \|^2_{L^2 (\pi)} \]
  \[ \leqslant C_{\delta} 2^{2 \beta k} \| \Delta_k \eta \|^2_{L^2 (\pi)} +
     \delta 2^{2 \beta k} \| \rho^{- b} \Delta_k \eta \|^2_{L^2 (\pi)} +
     \delta 2^{2 \beta k} \| \nabla \Delta_k \eta \|_{L^2 (\pi)}^2 . \]}
  Similarly, we may test the equation for $\Delta_k \zeta$ by $\pi^2 \Delta_k
  \zeta$ to obtain
  \begin{equation}
    \frac{1}{2} \partial_t \| \Delta_k \zeta \|_{L^2 (\pi)}^2 + \| \nabla
    \Delta_k \zeta \|_{L^2 (\pi)}^2 + \mu \| \Delta_k \zeta \|_{L^2 (\pi)}^2 +
    \| \rho^{- b} \Delta_k \zeta \|_{L^2 (\pi)}^2 \label{eq:zeta}
  \end{equation}
  \[ = \left\langle \pi^{} \Delta_k \zeta, \pi \Delta_k \left( \LL \zeta
     \right) \right\rangle - 2 \langle \pi \Delta_k \zeta, \frac{\nabla
     \pi}{\pi} \pi \nabla \Delta_k \zeta \rangle . \]
  Next, we estimate
  \[ | \langle \pi \Delta_k \zeta, \frac{\nabla \pi}{\pi} \pi \nabla \Delta_k
     \zeta \rangle | \leqslant C_{T, \delta} \| \rho^{1 - 2 b} \Delta_k \zeta
     \|_{L^2 (\pi)}^2 + \delta \| \nabla \Delta_k \zeta \|_{L^2 (\pi)}^2 \]
  \[ \leqslant C_{T, \delta} \| \Delta_k \zeta \|_{L^2 (\pi)}^2 + \delta \|
    \nabla \Delta_k \zeta \|_{L^2 (\pi)}^2, \]
  and by duality
  \[ \left| \left\langle \pi^2 \Delta_k \zeta, \Delta_k \LL \zeta
     \right\rangle \right| \lesssim 2^{2 \beta k} 2^{(1 + \kappa - 2 \beta) k}
     \| \Delta_k \zeta \|_{L^2 (\pi \rho^{- c})} 2^{- (1 + \kappa) k} \left\|
     \Delta_k \left( \LL \zeta \right) \right\|_{L^2 (\pi \rho^c)} \]
  \[ \leqslant 2^{2 \beta k} \left( C_{\delta} 2^{2 (1 + \kappa - 2 \beta) k}
     \| \Delta_k \zeta \|^2_{L^2 (\pi \rho^{- c})} + \delta 2^{- 2 (1 +
     \kappa) k} \left\| \Delta_k \left( \LL \zeta \right) \right\|^2_{L^2 (\pi
     \rho^c)} \right), \]
  where by a suitable choice of $c \in (0, b / 2)$ we again interpolate to obtain
  \[ 2^{2 (1 + \kappa - 2 \beta) k} \| \Delta_k \zeta \|^2_{L^2 (\pi \rho^{-
     c})} \leqslant C_{\delta} 2^{- 2 \beta k} \| \rho^{- b / 2} \Delta_k
     \zeta \|^2_{L^2 (\pi)} + \delta 2^{- 2 \beta k} \| \nabla \Delta_k \zeta
     \|^2_{L^2 (\pi)} \]
  \[ \leqslant C_{\delta} 2^{- 2 \beta k} \| \Delta_k \zeta \|^2_{L^2
     (\pi)} + \delta 2^{- 2 \beta k} \| \rho^{- b} \Delta_k \zeta \|^2_{L^2
     (\pi)} + \delta 2^{- 2 \beta k} \| \nabla \Delta_k \zeta \|_{L^2
     (\pi)}^2, \]
  provided $- \beta < 1 + \kappa - 2 \beta < 1 - \beta$ hence $\kappa < \beta
  < 1 + \kappa$.
  
  As the next step, we multiply (\ref{eq:eta}) by $2^{2 \beta k}$,
  $(\ref{eq:zeta})$ by $2^{- 2 \beta k}$, integrate both inequalities over
  $(0, t)$ for some $t \in (0, T]$. In addition, we choose $\delta$
  sufficiently small  in order to absorb some of the
  terms into the left hand side. Finally, we sum the two inequalities, use the
  fact that $\zeta (0) = 0, \eta (0) = 0$, and sum over $k$ to obtain
  \[ \frac{1}{2} \| \eta (t) \|_{B^{\beta}_{2, 2} (\pi)}^2 + \frac{1}{2}\int_0^t \|
     \nabla \eta \|_{B^{\beta}_{2, 2} (\pi)}^2 \mathd s + \frac{1}{2} \| \zeta
     (t) \|_{B^{- \beta}_{2, 2} (\pi)}^2 + \frac{1}{2}\int_0^t \| \nabla \zeta \|_{B^{-
     \beta}_{2, 2} (\pi)}^2 \mathd s \]
  \begin{equation}\label{eq:s1}
    \le C_{\delta} \int_{0}^{t}\| \eta \|^2_{B^{\beta}_{2,2}
     (\pi)}\mathd s +C_{\delta} \int_{0}^{t}\| \zeta \|^2_{B^{-\beta}_{2,2}
     (\pi)}\mathd s 
     \end{equation}
     \begin{equation}+ \delta \int_0^t \left\| \LL \eta \right\|^2_{B^{- \gamma}_{2, 2}
    (\pi \rho^a)} \mathd s + \delta \int_0^t \left\| \LL \zeta \right\|^2_{B^{-
    (1 + \kappa)}_{2, 2} (\pi \rho^c)} \mathd s. \label{eq:s}
  \end{equation}
  We remark that in the above inequality we omitted the two terms containing $\rho^{-b}$ as well as the two terms containing $\mu$ on the left hand side since they are no longer needed.
  So it remains to control $\left\| \LL \eta \right\|^2_{B^{- \gamma}_{2, 2}
  (\pi \rho^a)}$ and $\left\| \LL \zeta \right\|^2_{B^{- (1 + \kappa)}_{2, 2}
  (\pi \rho^c)}$. Note that both terms are multiplied by a small constant
  $\delta > 0$, which will be needed in order to absorb them into the left hand side.
  
 We set $\beta = 2 \kappa$, $\gamma = \frac 12 +\kappa$ for some $\kappa > 0$
  sufficiently small, which is also the parameter to be used in order to
  estimate the stochastic terms according to the Table~\ref{t:reg}. Then $a=(1/2-3\kappa)b/2$ whereas $c=\kappa b/2$
hence  $c<a$, which will be used below in order to control the time derivative of $\zeta$.
  
 In view of Lemma \ref{lem:par1} and Lemma
  \ref{lem:com1}, we may estimate all the terms in $\LL \eta$ as follows
  \[ \| 3 \llbracket X^2 \rrbracket \prec \zeta (t) \|_{B^{- \gamma}_{2, 2}
     (\pi_{t} \rho^a)} \lesssim \| \zeta (t) \|_{B^{1 - \beta}_{2, 2} (\pi_{t})}, \]
  \[ \left\| 9 \zeta \left( \llbracket X^2 \rrbracket \circ X^{\!\resizebox{0.6em}{!}{
\begin{tikzpicture}
\pgfpathmoveto{\pgfqpoint{0cm}{0cm}}
\pgfpathlineto{\pgfqpoint{1.376cm}{0cm}}
\pgfpathlineto{\pgfqpoint{1.376cm}{1.588cm}}
\pgfpathlineto{\pgfqpoint{0cm}{1.588cm}}
\pgfpathclose
\pgfusepath{clip}
\begin{pgfscope}
\begin{pgfscope}
\pgfpathmoveto{\pgfqpoint{0cm}{0cm}}
\pgfpathlineto{\pgfqpoint{1.376cm}{0cm}}
\pgfpathlineto{\pgfqpoint{1.376cm}{1.588cm}}
\pgfpathlineto{\pgfqpoint{0cm}{1.588cm}}
\pgfpathclose
\pgfusepath{clip}
\begin{pgfscope}
\begin{pgfscope}
\definecolor{eps2pgf_color}{gray}{0.976471}\pgfsetstrokecolor{eps2pgf_color}\pgfsetfillcolor{eps2pgf_color}
\pgfpathmoveto{\pgfqpoint{0cm}{0cm}}
\pgfpathlineto{\pgfqpoint{1.376cm}{0cm}}
\pgfpathlineto{\pgfqpoint{1.376cm}{1.588cm}}
\pgfpathlineto{\pgfqpoint{0cm}{1.588cm}}
\pgfpathclose
\pgfusepath{fill}
\end{pgfscope}
\begin{pgfscope}
\pgfsetdash{}{0cm}
\pgfsetlinewidth{0.818mm}
\pgfsetroundcap
\pgfsetroundjoin
\pgfsetmiterlimit{7.0}
\definecolor{eps2pgf_color}{gray}{0}\pgfsetstrokecolor{eps2pgf_color}\pgfsetfillcolor{eps2pgf_color}
\pgfpathmoveto{\pgfqpoint{0.117cm}{1.476cm}}
\pgfpathlineto{\pgfqpoint{0.682cm}{0.726cm}}
\pgfpathlineto{\pgfqpoint{1.246cm}{1.476cm}}
\pgfusepath{stroke}
\end{pgfscope}
\definecolor{eps2pgf_color}{gray}{0}\pgfsetstrokecolor{eps2pgf_color}\pgfsetfillcolor{eps2pgf_color}
\pgfpathmoveto{\pgfqpoint{0.273cm}{1.451cm}}
\pgfpathcurveto{\pgfqpoint{0.273cm}{1.487cm}}{\pgfqpoint{0.259cm}{1.522cm}}{\pgfqpoint{0.233cm}{1.547cm}}
\pgfpathcurveto{\pgfqpoint{0.207cm}{1.573cm}}{\pgfqpoint{0.173cm}{1.588cm}}{\pgfqpoint{0.137cm}{1.588cm}}
\pgfpathcurveto{\pgfqpoint{0.1cm}{1.588cm}}{\pgfqpoint{0.066cm}{1.573cm}}{\pgfqpoint{0.04cm}{1.547cm}}
\pgfpathcurveto{\pgfqpoint{0.014cm}{1.522cm}}{\pgfqpoint{0cm}{1.487cm}}{\pgfqpoint{0cm}{1.451cm}}
\pgfpathcurveto{\pgfqpoint{0cm}{1.414cm}}{\pgfqpoint{0.014cm}{1.379cm}}{\pgfqpoint{0.04cm}{1.354cm}}
\pgfpathcurveto{\pgfqpoint{0.066cm}{1.328cm}}{\pgfqpoint{0.1cm}{1.314cm}}{\pgfqpoint{0.137cm}{1.314cm}}
\pgfpathcurveto{\pgfqpoint{0.173cm}{1.314cm}}{\pgfqpoint{0.207cm}{1.328cm}}{\pgfqpoint{0.233cm}{1.354cm}}
\pgfpathcurveto{\pgfqpoint{0.259cm}{1.379cm}}{\pgfqpoint{0.273cm}{1.414cm}}{\pgfqpoint{0.273cm}{1.451cm}}
\pgfusepath{fill}
\pgfpathmoveto{\pgfqpoint{1.345cm}{1.426cm}}
\pgfpathcurveto{\pgfqpoint{1.345cm}{1.463cm}}{\pgfqpoint{1.331cm}{1.497cm}}{\pgfqpoint{1.305cm}{1.523cm}}
\pgfpathcurveto{\pgfqpoint{1.28cm}{1.549cm}}{\pgfqpoint{1.245cm}{1.563cm}}{\pgfqpoint{1.209cm}{1.563cm}}
\pgfpathcurveto{\pgfqpoint{1.172cm}{1.563cm}}{\pgfqpoint{1.138cm}{1.549cm}}{\pgfqpoint{1.112cm}{1.523cm}}
\pgfpathcurveto{\pgfqpoint{1.087cm}{1.497cm}}{\pgfqpoint{1.072cm}{1.463cm}}{\pgfqpoint{1.072cm}{1.426cm}}
\pgfpathcurveto{\pgfqpoint{1.072cm}{1.39cm}}{\pgfqpoint{1.087cm}{1.355cm}}{\pgfqpoint{1.112cm}{1.329cm}}
\pgfpathcurveto{\pgfqpoint{1.138cm}{1.304cm}}{\pgfqpoint{1.172cm}{1.289cm}}{\pgfqpoint{1.209cm}{1.289cm}}
\pgfpathcurveto{\pgfqpoint{1.245cm}{1.289cm}}{\pgfqpoint{1.28cm}{1.304cm}}{\pgfqpoint{1.305cm}{1.329cm}}
\pgfpathcurveto{\pgfqpoint{1.331cm}{1.355cm}}{\pgfqpoint{1.345cm}{1.39cm}}{\pgfqpoint{1.345cm}{1.426cm}}
\pgfusepath{fill}
\begin{pgfscope}
\pgfsetdash{}{0cm}
\pgfsetlinewidth{0.818mm}
\pgfsetroundcap
\pgfsetmiterlimit{4.0}
\pgfpathmoveto{\pgfqpoint{0.682cm}{0.726cm}}
\pgfpathlineto{\pgfqpoint{0.682cm}{0.097cm}}
\pgfusepath{stroke}
\end{pgfscope}
\end{pgfscope}
\end{pgfscope}
\end{pgfscope}
\end{tikzpicture}}} -
     \frac{b}{3} \right) (t) \right\|_{B^{- \gamma}_{2, 2} (\pi _{t}\rho^a)}
     \lesssim \| \zeta (t) \|_{B^{1 - \beta}_{2, 2} (\pi_{t})}, \]
  \[ \left\| 9 \tmop{com} (\zeta, X^{\!\resizebox{0.6em}{!}{
\begin{tikzpicture}
\pgfpathmoveto{\pgfqpoint{0cm}{0cm}}
\pgfpathlineto{\pgfqpoint{1.376cm}{0cm}}
\pgfpathlineto{\pgfqpoint{1.376cm}{1.588cm}}
\pgfpathlineto{\pgfqpoint{0cm}{1.588cm}}
\pgfpathclose
\pgfusepath{clip}
\begin{pgfscope}
\begin{pgfscope}
\pgfpathmoveto{\pgfqpoint{0cm}{0cm}}
\pgfpathlineto{\pgfqpoint{1.376cm}{0cm}}
\pgfpathlineto{\pgfqpoint{1.376cm}{1.588cm}}
\pgfpathlineto{\pgfqpoint{0cm}{1.588cm}}
\pgfpathclose
\pgfusepath{clip}
\begin{pgfscope}
\begin{pgfscope}
\definecolor{eps2pgf_color}{gray}{0.976471}\pgfsetstrokecolor{eps2pgf_color}\pgfsetfillcolor{eps2pgf_color}
\pgfpathmoveto{\pgfqpoint{0cm}{0cm}}
\pgfpathlineto{\pgfqpoint{1.376cm}{0cm}}
\pgfpathlineto{\pgfqpoint{1.376cm}{1.588cm}}
\pgfpathlineto{\pgfqpoint{0cm}{1.588cm}}
\pgfpathclose
\pgfusepath{fill}
\end{pgfscope}
\begin{pgfscope}
\pgfsetdash{}{0cm}
\pgfsetlinewidth{0.818mm}
\pgfsetroundcap
\pgfsetroundjoin
\pgfsetmiterlimit{7.0}
\definecolor{eps2pgf_color}{gray}{0}\pgfsetstrokecolor{eps2pgf_color}\pgfsetfillcolor{eps2pgf_color}
\pgfpathmoveto{\pgfqpoint{0.117cm}{1.476cm}}
\pgfpathlineto{\pgfqpoint{0.682cm}{0.726cm}}
\pgfpathlineto{\pgfqpoint{1.246cm}{1.476cm}}
\pgfusepath{stroke}
\end{pgfscope}
\definecolor{eps2pgf_color}{gray}{0}\pgfsetstrokecolor{eps2pgf_color}\pgfsetfillcolor{eps2pgf_color}
\pgfpathmoveto{\pgfqpoint{0.273cm}{1.451cm}}
\pgfpathcurveto{\pgfqpoint{0.273cm}{1.487cm}}{\pgfqpoint{0.259cm}{1.522cm}}{\pgfqpoint{0.233cm}{1.547cm}}
\pgfpathcurveto{\pgfqpoint{0.207cm}{1.573cm}}{\pgfqpoint{0.173cm}{1.588cm}}{\pgfqpoint{0.137cm}{1.588cm}}
\pgfpathcurveto{\pgfqpoint{0.1cm}{1.588cm}}{\pgfqpoint{0.066cm}{1.573cm}}{\pgfqpoint{0.04cm}{1.547cm}}
\pgfpathcurveto{\pgfqpoint{0.014cm}{1.522cm}}{\pgfqpoint{0cm}{1.487cm}}{\pgfqpoint{0cm}{1.451cm}}
\pgfpathcurveto{\pgfqpoint{0cm}{1.414cm}}{\pgfqpoint{0.014cm}{1.379cm}}{\pgfqpoint{0.04cm}{1.354cm}}
\pgfpathcurveto{\pgfqpoint{0.066cm}{1.328cm}}{\pgfqpoint{0.1cm}{1.314cm}}{\pgfqpoint{0.137cm}{1.314cm}}
\pgfpathcurveto{\pgfqpoint{0.173cm}{1.314cm}}{\pgfqpoint{0.207cm}{1.328cm}}{\pgfqpoint{0.233cm}{1.354cm}}
\pgfpathcurveto{\pgfqpoint{0.259cm}{1.379cm}}{\pgfqpoint{0.273cm}{1.414cm}}{\pgfqpoint{0.273cm}{1.451cm}}
\pgfusepath{fill}
\pgfpathmoveto{\pgfqpoint{1.345cm}{1.426cm}}
\pgfpathcurveto{\pgfqpoint{1.345cm}{1.463cm}}{\pgfqpoint{1.331cm}{1.497cm}}{\pgfqpoint{1.305cm}{1.523cm}}
\pgfpathcurveto{\pgfqpoint{1.28cm}{1.549cm}}{\pgfqpoint{1.245cm}{1.563cm}}{\pgfqpoint{1.209cm}{1.563cm}}
\pgfpathcurveto{\pgfqpoint{1.172cm}{1.563cm}}{\pgfqpoint{1.138cm}{1.549cm}}{\pgfqpoint{1.112cm}{1.523cm}}
\pgfpathcurveto{\pgfqpoint{1.087cm}{1.497cm}}{\pgfqpoint{1.072cm}{1.463cm}}{\pgfqpoint{1.072cm}{1.426cm}}
\pgfpathcurveto{\pgfqpoint{1.072cm}{1.39cm}}{\pgfqpoint{1.087cm}{1.355cm}}{\pgfqpoint{1.112cm}{1.329cm}}
\pgfpathcurveto{\pgfqpoint{1.138cm}{1.304cm}}{\pgfqpoint{1.172cm}{1.289cm}}{\pgfqpoint{1.209cm}{1.289cm}}
\pgfpathcurveto{\pgfqpoint{1.245cm}{1.289cm}}{\pgfqpoint{1.28cm}{1.304cm}}{\pgfqpoint{1.305cm}{1.329cm}}
\pgfpathcurveto{\pgfqpoint{1.331cm}{1.355cm}}{\pgfqpoint{1.345cm}{1.39cm}}{\pgfqpoint{1.345cm}{1.426cm}}
\pgfusepath{fill}
\begin{pgfscope}
\pgfsetdash{}{0cm}
\pgfsetlinewidth{0.818mm}
\pgfsetroundcap
\pgfsetmiterlimit{4.0}
\pgfpathmoveto{\pgfqpoint{0.682cm}{0.726cm}}
\pgfpathlineto{\pgfqpoint{0.682cm}{0.097cm}}
\pgfusepath{stroke}
\end{pgfscope}
\end{pgfscope}
\end{pgfscope}
\end{pgfscope}
\end{tikzpicture}}}, \llbracket X^2 \rrbracket) (t)
     \right\|_{B^{- \gamma}_{2, 2} (\pi_{t} \rho^a)} \lesssim \| \zeta (t)
     \|_{B^{1 - \beta}_{2, 2} (\pi_{t})}, \]
  \[ \| 3 \llbracket X^2 \rrbracket \circ \eta (t) \|_{B^{- \gamma}_{2, 2}
     (\pi _{t}\rho^a)} \lesssim \| \eta (t) \|_{B^{1 + \beta}_{2, 2} (\pi_{t})}, \]
  \[ \left\| 3 [\LL, \zeta \prec] X^{\!\resizebox{0.6em}{!}{
\begin{tikzpicture}
\pgfpathmoveto{\pgfqpoint{0cm}{0cm}}
\pgfpathlineto{\pgfqpoint{1.376cm}{0cm}}
\pgfpathlineto{\pgfqpoint{1.376cm}{1.588cm}}
\pgfpathlineto{\pgfqpoint{0cm}{1.588cm}}
\pgfpathclose
\pgfusepath{clip}
\begin{pgfscope}
\begin{pgfscope}
\pgfpathmoveto{\pgfqpoint{0cm}{0cm}}
\pgfpathlineto{\pgfqpoint{1.376cm}{0cm}}
\pgfpathlineto{\pgfqpoint{1.376cm}{1.588cm}}
\pgfpathlineto{\pgfqpoint{0cm}{1.588cm}}
\pgfpathclose
\pgfusepath{clip}
\begin{pgfscope}
\begin{pgfscope}
\definecolor{eps2pgf_color}{gray}{0.976471}\pgfsetstrokecolor{eps2pgf_color}\pgfsetfillcolor{eps2pgf_color}
\pgfpathmoveto{\pgfqpoint{0cm}{0cm}}
\pgfpathlineto{\pgfqpoint{1.376cm}{0cm}}
\pgfpathlineto{\pgfqpoint{1.376cm}{1.588cm}}
\pgfpathlineto{\pgfqpoint{0cm}{1.588cm}}
\pgfpathclose
\pgfusepath{fill}
\end{pgfscope}
\begin{pgfscope}
\pgfsetdash{}{0cm}
\pgfsetlinewidth{0.818mm}
\pgfsetroundcap
\pgfsetroundjoin
\pgfsetmiterlimit{7.0}
\definecolor{eps2pgf_color}{gray}{0}\pgfsetstrokecolor{eps2pgf_color}\pgfsetfillcolor{eps2pgf_color}
\pgfpathmoveto{\pgfqpoint{0.117cm}{1.476cm}}
\pgfpathlineto{\pgfqpoint{0.682cm}{0.726cm}}
\pgfpathlineto{\pgfqpoint{1.246cm}{1.476cm}}
\pgfusepath{stroke}
\end{pgfscope}
\definecolor{eps2pgf_color}{gray}{0}\pgfsetstrokecolor{eps2pgf_color}\pgfsetfillcolor{eps2pgf_color}
\pgfpathmoveto{\pgfqpoint{0.273cm}{1.451cm}}
\pgfpathcurveto{\pgfqpoint{0.273cm}{1.487cm}}{\pgfqpoint{0.259cm}{1.522cm}}{\pgfqpoint{0.233cm}{1.547cm}}
\pgfpathcurveto{\pgfqpoint{0.207cm}{1.573cm}}{\pgfqpoint{0.173cm}{1.588cm}}{\pgfqpoint{0.137cm}{1.588cm}}
\pgfpathcurveto{\pgfqpoint{0.1cm}{1.588cm}}{\pgfqpoint{0.066cm}{1.573cm}}{\pgfqpoint{0.04cm}{1.547cm}}
\pgfpathcurveto{\pgfqpoint{0.014cm}{1.522cm}}{\pgfqpoint{0cm}{1.487cm}}{\pgfqpoint{0cm}{1.451cm}}
\pgfpathcurveto{\pgfqpoint{0cm}{1.414cm}}{\pgfqpoint{0.014cm}{1.379cm}}{\pgfqpoint{0.04cm}{1.354cm}}
\pgfpathcurveto{\pgfqpoint{0.066cm}{1.328cm}}{\pgfqpoint{0.1cm}{1.314cm}}{\pgfqpoint{0.137cm}{1.314cm}}
\pgfpathcurveto{\pgfqpoint{0.173cm}{1.314cm}}{\pgfqpoint{0.207cm}{1.328cm}}{\pgfqpoint{0.233cm}{1.354cm}}
\pgfpathcurveto{\pgfqpoint{0.259cm}{1.379cm}}{\pgfqpoint{0.273cm}{1.414cm}}{\pgfqpoint{0.273cm}{1.451cm}}
\pgfusepath{fill}
\pgfpathmoveto{\pgfqpoint{1.345cm}{1.426cm}}
\pgfpathcurveto{\pgfqpoint{1.345cm}{1.463cm}}{\pgfqpoint{1.331cm}{1.497cm}}{\pgfqpoint{1.305cm}{1.523cm}}
\pgfpathcurveto{\pgfqpoint{1.28cm}{1.549cm}}{\pgfqpoint{1.245cm}{1.563cm}}{\pgfqpoint{1.209cm}{1.563cm}}
\pgfpathcurveto{\pgfqpoint{1.172cm}{1.563cm}}{\pgfqpoint{1.138cm}{1.549cm}}{\pgfqpoint{1.112cm}{1.523cm}}
\pgfpathcurveto{\pgfqpoint{1.087cm}{1.497cm}}{\pgfqpoint{1.072cm}{1.463cm}}{\pgfqpoint{1.072cm}{1.426cm}}
\pgfpathcurveto{\pgfqpoint{1.072cm}{1.39cm}}{\pgfqpoint{1.087cm}{1.355cm}}{\pgfqpoint{1.112cm}{1.329cm}}
\pgfpathcurveto{\pgfqpoint{1.138cm}{1.304cm}}{\pgfqpoint{1.172cm}{1.289cm}}{\pgfqpoint{1.209cm}{1.289cm}}
\pgfpathcurveto{\pgfqpoint{1.245cm}{1.289cm}}{\pgfqpoint{1.28cm}{1.304cm}}{\pgfqpoint{1.305cm}{1.329cm}}
\pgfpathcurveto{\pgfqpoint{1.331cm}{1.355cm}}{\pgfqpoint{1.345cm}{1.39cm}}{\pgfqpoint{1.345cm}{1.426cm}}
\pgfusepath{fill}
\begin{pgfscope}
\pgfsetdash{}{0cm}
\pgfsetlinewidth{0.818mm}
\pgfsetroundcap
\pgfsetmiterlimit{4.0}
\pgfpathmoveto{\pgfqpoint{0.682cm}{0.726cm}}
\pgfpathlineto{\pgfqpoint{0.682cm}{0.097cm}}
\pgfusepath{stroke}
\end{pgfscope}
\end{pgfscope}
\end{pgfscope}
\end{pgfscope}
\end{tikzpicture}}} (t)\right\|_{B^{-
     \gamma}_{2, 2} (\pi _{t}\rho^a)} \lesssim \left\| (\partial_{t}\zeta )\prec X^{\!\resizebox{0.6em}{!}{
\begin{tikzpicture}
\pgfpathmoveto{\pgfqpoint{0cm}{0cm}}
\pgfpathlineto{\pgfqpoint{1.376cm}{0cm}}
\pgfpathlineto{\pgfqpoint{1.376cm}{1.588cm}}
\pgfpathlineto{\pgfqpoint{0cm}{1.588cm}}
\pgfpathclose
\pgfusepath{clip}
\begin{pgfscope}
\begin{pgfscope}
\pgfpathmoveto{\pgfqpoint{0cm}{0cm}}
\pgfpathlineto{\pgfqpoint{1.376cm}{0cm}}
\pgfpathlineto{\pgfqpoint{1.376cm}{1.588cm}}
\pgfpathlineto{\pgfqpoint{0cm}{1.588cm}}
\pgfpathclose
\pgfusepath{clip}
\begin{pgfscope}
\begin{pgfscope}
\definecolor{eps2pgf_color}{gray}{0.976471}\pgfsetstrokecolor{eps2pgf_color}\pgfsetfillcolor{eps2pgf_color}
\pgfpathmoveto{\pgfqpoint{0cm}{0cm}}
\pgfpathlineto{\pgfqpoint{1.376cm}{0cm}}
\pgfpathlineto{\pgfqpoint{1.376cm}{1.588cm}}
\pgfpathlineto{\pgfqpoint{0cm}{1.588cm}}
\pgfpathclose
\pgfusepath{fill}
\end{pgfscope}
\begin{pgfscope}
\pgfsetdash{}{0cm}
\pgfsetlinewidth{0.818mm}
\pgfsetroundcap
\pgfsetroundjoin
\pgfsetmiterlimit{7.0}
\definecolor{eps2pgf_color}{gray}{0}\pgfsetstrokecolor{eps2pgf_color}\pgfsetfillcolor{eps2pgf_color}
\pgfpathmoveto{\pgfqpoint{0.117cm}{1.476cm}}
\pgfpathlineto{\pgfqpoint{0.682cm}{0.726cm}}
\pgfpathlineto{\pgfqpoint{1.246cm}{1.476cm}}
\pgfusepath{stroke}
\end{pgfscope}
\definecolor{eps2pgf_color}{gray}{0}\pgfsetstrokecolor{eps2pgf_color}\pgfsetfillcolor{eps2pgf_color}
\pgfpathmoveto{\pgfqpoint{0.273cm}{1.451cm}}
\pgfpathcurveto{\pgfqpoint{0.273cm}{1.487cm}}{\pgfqpoint{0.259cm}{1.522cm}}{\pgfqpoint{0.233cm}{1.547cm}}
\pgfpathcurveto{\pgfqpoint{0.207cm}{1.573cm}}{\pgfqpoint{0.173cm}{1.588cm}}{\pgfqpoint{0.137cm}{1.588cm}}
\pgfpathcurveto{\pgfqpoint{0.1cm}{1.588cm}}{\pgfqpoint{0.066cm}{1.573cm}}{\pgfqpoint{0.04cm}{1.547cm}}
\pgfpathcurveto{\pgfqpoint{0.014cm}{1.522cm}}{\pgfqpoint{0cm}{1.487cm}}{\pgfqpoint{0cm}{1.451cm}}
\pgfpathcurveto{\pgfqpoint{0cm}{1.414cm}}{\pgfqpoint{0.014cm}{1.379cm}}{\pgfqpoint{0.04cm}{1.354cm}}
\pgfpathcurveto{\pgfqpoint{0.066cm}{1.328cm}}{\pgfqpoint{0.1cm}{1.314cm}}{\pgfqpoint{0.137cm}{1.314cm}}
\pgfpathcurveto{\pgfqpoint{0.173cm}{1.314cm}}{\pgfqpoint{0.207cm}{1.328cm}}{\pgfqpoint{0.233cm}{1.354cm}}
\pgfpathcurveto{\pgfqpoint{0.259cm}{1.379cm}}{\pgfqpoint{0.273cm}{1.414cm}}{\pgfqpoint{0.273cm}{1.451cm}}
\pgfusepath{fill}
\pgfpathmoveto{\pgfqpoint{1.345cm}{1.426cm}}
\pgfpathcurveto{\pgfqpoint{1.345cm}{1.463cm}}{\pgfqpoint{1.331cm}{1.497cm}}{\pgfqpoint{1.305cm}{1.523cm}}
\pgfpathcurveto{\pgfqpoint{1.28cm}{1.549cm}}{\pgfqpoint{1.245cm}{1.563cm}}{\pgfqpoint{1.209cm}{1.563cm}}
\pgfpathcurveto{\pgfqpoint{1.172cm}{1.563cm}}{\pgfqpoint{1.138cm}{1.549cm}}{\pgfqpoint{1.112cm}{1.523cm}}
\pgfpathcurveto{\pgfqpoint{1.087cm}{1.497cm}}{\pgfqpoint{1.072cm}{1.463cm}}{\pgfqpoint{1.072cm}{1.426cm}}
\pgfpathcurveto{\pgfqpoint{1.072cm}{1.39cm}}{\pgfqpoint{1.087cm}{1.355cm}}{\pgfqpoint{1.112cm}{1.329cm}}
\pgfpathcurveto{\pgfqpoint{1.138cm}{1.304cm}}{\pgfqpoint{1.172cm}{1.289cm}}{\pgfqpoint{1.209cm}{1.289cm}}
\pgfpathcurveto{\pgfqpoint{1.245cm}{1.289cm}}{\pgfqpoint{1.28cm}{1.304cm}}{\pgfqpoint{1.305cm}{1.329cm}}
\pgfpathcurveto{\pgfqpoint{1.331cm}{1.355cm}}{\pgfqpoint{1.345cm}{1.39cm}}{\pgfqpoint{1.345cm}{1.426cm}}
\pgfusepath{fill}
\begin{pgfscope}
\pgfsetdash{}{0cm}
\pgfsetlinewidth{0.818mm}
\pgfsetroundcap
\pgfsetmiterlimit{4.0}
\pgfpathmoveto{\pgfqpoint{0.682cm}{0.726cm}}
\pgfpathlineto{\pgfqpoint{0.682cm}{0.097cm}}
\pgfusepath{stroke}
\end{pgfscope}
\end{pgfscope}
\end{pgfscope}
\end{pgfscope}
\end{tikzpicture}}} (t)\right\|_{ B^{-
     \gamma}_{2, 2} (\pi _{t}\rho^a)}+\left\| (\Delta\zeta )\prec X^{\!\resizebox{0.6em}{!}{
\begin{tikzpicture}
\pgfpathmoveto{\pgfqpoint{0cm}{0cm}}
\pgfpathlineto{\pgfqpoint{1.376cm}{0cm}}
\pgfpathlineto{\pgfqpoint{1.376cm}{1.588cm}}
\pgfpathlineto{\pgfqpoint{0cm}{1.588cm}}
\pgfpathclose
\pgfusepath{clip}
\begin{pgfscope}
\begin{pgfscope}
\pgfpathmoveto{\pgfqpoint{0cm}{0cm}}
\pgfpathlineto{\pgfqpoint{1.376cm}{0cm}}
\pgfpathlineto{\pgfqpoint{1.376cm}{1.588cm}}
\pgfpathlineto{\pgfqpoint{0cm}{1.588cm}}
\pgfpathclose
\pgfusepath{clip}
\begin{pgfscope}
\begin{pgfscope}
\definecolor{eps2pgf_color}{gray}{0.976471}\pgfsetstrokecolor{eps2pgf_color}\pgfsetfillcolor{eps2pgf_color}
\pgfpathmoveto{\pgfqpoint{0cm}{0cm}}
\pgfpathlineto{\pgfqpoint{1.376cm}{0cm}}
\pgfpathlineto{\pgfqpoint{1.376cm}{1.588cm}}
\pgfpathlineto{\pgfqpoint{0cm}{1.588cm}}
\pgfpathclose
\pgfusepath{fill}
\end{pgfscope}
\begin{pgfscope}
\pgfsetdash{}{0cm}
\pgfsetlinewidth{0.818mm}
\pgfsetroundcap
\pgfsetroundjoin
\pgfsetmiterlimit{7.0}
\definecolor{eps2pgf_color}{gray}{0}\pgfsetstrokecolor{eps2pgf_color}\pgfsetfillcolor{eps2pgf_color}
\pgfpathmoveto{\pgfqpoint{0.117cm}{1.476cm}}
\pgfpathlineto{\pgfqpoint{0.682cm}{0.726cm}}
\pgfpathlineto{\pgfqpoint{1.246cm}{1.476cm}}
\pgfusepath{stroke}
\end{pgfscope}
\definecolor{eps2pgf_color}{gray}{0}\pgfsetstrokecolor{eps2pgf_color}\pgfsetfillcolor{eps2pgf_color}
\pgfpathmoveto{\pgfqpoint{0.273cm}{1.451cm}}
\pgfpathcurveto{\pgfqpoint{0.273cm}{1.487cm}}{\pgfqpoint{0.259cm}{1.522cm}}{\pgfqpoint{0.233cm}{1.547cm}}
\pgfpathcurveto{\pgfqpoint{0.207cm}{1.573cm}}{\pgfqpoint{0.173cm}{1.588cm}}{\pgfqpoint{0.137cm}{1.588cm}}
\pgfpathcurveto{\pgfqpoint{0.1cm}{1.588cm}}{\pgfqpoint{0.066cm}{1.573cm}}{\pgfqpoint{0.04cm}{1.547cm}}
\pgfpathcurveto{\pgfqpoint{0.014cm}{1.522cm}}{\pgfqpoint{0cm}{1.487cm}}{\pgfqpoint{0cm}{1.451cm}}
\pgfpathcurveto{\pgfqpoint{0cm}{1.414cm}}{\pgfqpoint{0.014cm}{1.379cm}}{\pgfqpoint{0.04cm}{1.354cm}}
\pgfpathcurveto{\pgfqpoint{0.066cm}{1.328cm}}{\pgfqpoint{0.1cm}{1.314cm}}{\pgfqpoint{0.137cm}{1.314cm}}
\pgfpathcurveto{\pgfqpoint{0.173cm}{1.314cm}}{\pgfqpoint{0.207cm}{1.328cm}}{\pgfqpoint{0.233cm}{1.354cm}}
\pgfpathcurveto{\pgfqpoint{0.259cm}{1.379cm}}{\pgfqpoint{0.273cm}{1.414cm}}{\pgfqpoint{0.273cm}{1.451cm}}
\pgfusepath{fill}
\pgfpathmoveto{\pgfqpoint{1.345cm}{1.426cm}}
\pgfpathcurveto{\pgfqpoint{1.345cm}{1.463cm}}{\pgfqpoint{1.331cm}{1.497cm}}{\pgfqpoint{1.305cm}{1.523cm}}
\pgfpathcurveto{\pgfqpoint{1.28cm}{1.549cm}}{\pgfqpoint{1.245cm}{1.563cm}}{\pgfqpoint{1.209cm}{1.563cm}}
\pgfpathcurveto{\pgfqpoint{1.172cm}{1.563cm}}{\pgfqpoint{1.138cm}{1.549cm}}{\pgfqpoint{1.112cm}{1.523cm}}
\pgfpathcurveto{\pgfqpoint{1.087cm}{1.497cm}}{\pgfqpoint{1.072cm}{1.463cm}}{\pgfqpoint{1.072cm}{1.426cm}}
\pgfpathcurveto{\pgfqpoint{1.072cm}{1.39cm}}{\pgfqpoint{1.087cm}{1.355cm}}{\pgfqpoint{1.112cm}{1.329cm}}
\pgfpathcurveto{\pgfqpoint{1.138cm}{1.304cm}}{\pgfqpoint{1.172cm}{1.289cm}}{\pgfqpoint{1.209cm}{1.289cm}}
\pgfpathcurveto{\pgfqpoint{1.245cm}{1.289cm}}{\pgfqpoint{1.28cm}{1.304cm}}{\pgfqpoint{1.305cm}{1.329cm}}
\pgfpathcurveto{\pgfqpoint{1.331cm}{1.355cm}}{\pgfqpoint{1.345cm}{1.39cm}}{\pgfqpoint{1.345cm}{1.426cm}}
\pgfusepath{fill}
\begin{pgfscope}
\pgfsetdash{}{0cm}
\pgfsetlinewidth{0.818mm}
\pgfsetroundcap
\pgfsetmiterlimit{4.0}
\pgfpathmoveto{\pgfqpoint{0.682cm}{0.726cm}}
\pgfpathlineto{\pgfqpoint{0.682cm}{0.097cm}}
\pgfusepath{stroke}
\end{pgfscope}
\end{pgfscope}
\end{pgfscope}
\end{pgfscope}
\end{tikzpicture}}}(t) \right\|_{B^{-
     \gamma}_{2, 2} (\pi _{t}\rho^a)}\]
     \[
     +\left\| (\nabla\zeta )\prec \nablaX^{\!\resizebox{0.6em}{!}{
\begin{tikzpicture}
\pgfpathmoveto{\pgfqpoint{0cm}{0cm}}
\pgfpathlineto{\pgfqpoint{1.376cm}{0cm}}
\pgfpathlineto{\pgfqpoint{1.376cm}{1.588cm}}
\pgfpathlineto{\pgfqpoint{0cm}{1.588cm}}
\pgfpathclose
\pgfusepath{clip}
\begin{pgfscope}
\begin{pgfscope}
\pgfpathmoveto{\pgfqpoint{0cm}{0cm}}
\pgfpathlineto{\pgfqpoint{1.376cm}{0cm}}
\pgfpathlineto{\pgfqpoint{1.376cm}{1.588cm}}
\pgfpathlineto{\pgfqpoint{0cm}{1.588cm}}
\pgfpathclose
\pgfusepath{clip}
\begin{pgfscope}
\begin{pgfscope}
\definecolor{eps2pgf_color}{gray}{0.976471}\pgfsetstrokecolor{eps2pgf_color}\pgfsetfillcolor{eps2pgf_color}
\pgfpathmoveto{\pgfqpoint{0cm}{0cm}}
\pgfpathlineto{\pgfqpoint{1.376cm}{0cm}}
\pgfpathlineto{\pgfqpoint{1.376cm}{1.588cm}}
\pgfpathlineto{\pgfqpoint{0cm}{1.588cm}}
\pgfpathclose
\pgfusepath{fill}
\end{pgfscope}
\begin{pgfscope}
\pgfsetdash{}{0cm}
\pgfsetlinewidth{0.818mm}
\pgfsetroundcap
\pgfsetroundjoin
\pgfsetmiterlimit{7.0}
\definecolor{eps2pgf_color}{gray}{0}\pgfsetstrokecolor{eps2pgf_color}\pgfsetfillcolor{eps2pgf_color}
\pgfpathmoveto{\pgfqpoint{0.117cm}{1.476cm}}
\pgfpathlineto{\pgfqpoint{0.682cm}{0.726cm}}
\pgfpathlineto{\pgfqpoint{1.246cm}{1.476cm}}
\pgfusepath{stroke}
\end{pgfscope}
\definecolor{eps2pgf_color}{gray}{0}\pgfsetstrokecolor{eps2pgf_color}\pgfsetfillcolor{eps2pgf_color}
\pgfpathmoveto{\pgfqpoint{0.273cm}{1.451cm}}
\pgfpathcurveto{\pgfqpoint{0.273cm}{1.487cm}}{\pgfqpoint{0.259cm}{1.522cm}}{\pgfqpoint{0.233cm}{1.547cm}}
\pgfpathcurveto{\pgfqpoint{0.207cm}{1.573cm}}{\pgfqpoint{0.173cm}{1.588cm}}{\pgfqpoint{0.137cm}{1.588cm}}
\pgfpathcurveto{\pgfqpoint{0.1cm}{1.588cm}}{\pgfqpoint{0.066cm}{1.573cm}}{\pgfqpoint{0.04cm}{1.547cm}}
\pgfpathcurveto{\pgfqpoint{0.014cm}{1.522cm}}{\pgfqpoint{0cm}{1.487cm}}{\pgfqpoint{0cm}{1.451cm}}
\pgfpathcurveto{\pgfqpoint{0cm}{1.414cm}}{\pgfqpoint{0.014cm}{1.379cm}}{\pgfqpoint{0.04cm}{1.354cm}}
\pgfpathcurveto{\pgfqpoint{0.066cm}{1.328cm}}{\pgfqpoint{0.1cm}{1.314cm}}{\pgfqpoint{0.137cm}{1.314cm}}
\pgfpathcurveto{\pgfqpoint{0.173cm}{1.314cm}}{\pgfqpoint{0.207cm}{1.328cm}}{\pgfqpoint{0.233cm}{1.354cm}}
\pgfpathcurveto{\pgfqpoint{0.259cm}{1.379cm}}{\pgfqpoint{0.273cm}{1.414cm}}{\pgfqpoint{0.273cm}{1.451cm}}
\pgfusepath{fill}
\pgfpathmoveto{\pgfqpoint{1.345cm}{1.426cm}}
\pgfpathcurveto{\pgfqpoint{1.345cm}{1.463cm}}{\pgfqpoint{1.331cm}{1.497cm}}{\pgfqpoint{1.305cm}{1.523cm}}
\pgfpathcurveto{\pgfqpoint{1.28cm}{1.549cm}}{\pgfqpoint{1.245cm}{1.563cm}}{\pgfqpoint{1.209cm}{1.563cm}}
\pgfpathcurveto{\pgfqpoint{1.172cm}{1.563cm}}{\pgfqpoint{1.138cm}{1.549cm}}{\pgfqpoint{1.112cm}{1.523cm}}
\pgfpathcurveto{\pgfqpoint{1.087cm}{1.497cm}}{\pgfqpoint{1.072cm}{1.463cm}}{\pgfqpoint{1.072cm}{1.426cm}}
\pgfpathcurveto{\pgfqpoint{1.072cm}{1.39cm}}{\pgfqpoint{1.087cm}{1.355cm}}{\pgfqpoint{1.112cm}{1.329cm}}
\pgfpathcurveto{\pgfqpoint{1.138cm}{1.304cm}}{\pgfqpoint{1.172cm}{1.289cm}}{\pgfqpoint{1.209cm}{1.289cm}}
\pgfpathcurveto{\pgfqpoint{1.245cm}{1.289cm}}{\pgfqpoint{1.28cm}{1.304cm}}{\pgfqpoint{1.305cm}{1.329cm}}
\pgfpathcurveto{\pgfqpoint{1.331cm}{1.355cm}}{\pgfqpoint{1.345cm}{1.39cm}}{\pgfqpoint{1.345cm}{1.426cm}}
\pgfusepath{fill}
\begin{pgfscope}
\pgfsetdash{}{0cm}
\pgfsetlinewidth{0.818mm}
\pgfsetroundcap
\pgfsetmiterlimit{4.0}
\pgfpathmoveto{\pgfqpoint{0.682cm}{0.726cm}}
\pgfpathlineto{\pgfqpoint{0.682cm}{0.097cm}}
\pgfusepath{stroke}
\end{pgfscope}
\end{pgfscope}
\end{pgfscope}
\end{pgfscope}
\end{tikzpicture}}} (t)\right\|_{ B^{-
     \gamma}_{2, 2} (\pi_{t} \rho^a)}\lesssim \| \partial_{t}\zeta (t)\|_{ B^{-1-\beta}_{2,2} (\pi_{t} \rho^{c})} + \| \zeta (t)\|_{ B^{1 -
     \beta}_{2, 2} (\pi_{t})}, \]
  \[ \| \Upsilon \zeta (t) \|_{B^{- \gamma}_{2, 2} (\pi_{t} \rho^a)} \lesssim \|
     \zeta (t) \|_{B^{1 - \beta}_{2, 2} (\pi_{t})} . \]
     We used the fact that $c<a$ to estimate the commutator above. 
  Besides, note that $\LL \zeta$ contains only one term which does not appear
  in $\LL \eta$ and it does not contain any term which requires time regularity. The additional term is controlled by
  \[ \| 3 \llbracket X^2 \rrbracket \succ \zeta (t) \|_{B^{- 1 - \kappa}_{2,
     2} (\pi_{t} \rho^c)} \lesssim \| \zeta (t) \|_{B^{1 - \beta}_{2, 2} (\pi_{t})} .
  \]
Using the equation for $\zeta$ we get
  \[ \| \partial_t \zeta(t) \|_{ B^{- 1 - \beta}_{2, 2} (\pi_{t} \rho^c)}
     \lesssim \left\| (\mu - \Delta) \zeta (t) - \LL \zeta (t) \right\|_{
     B^{- 1 - \beta}_{2, 2} (\rho^c \pi_{t})} \]
  \[ \lesssim \| \zeta (t)\|_{B^{1 - \beta}_{2, 2} (\pi_{t})} + \left\| \LL
     \zeta (t) \right\|_{B^{- 1 - \kappa}_{2, 2} (\pi_{t} \rho^c)}\lesssim \| \zeta(t) \|_{ B^{1 - \beta}_{2, 2} (\pi_{t})}  + \| \eta(t)
     \|_{ B^{1 + \beta}_{2, 2} (\pi_{t})}, \]
   where we applied the above estimates for $\LL \zeta$ again. Therefore, it
  follows
  \[ \left\| \LL \eta(t) \right\|_{ B^{- \gamma}_{2, 2} (\pi _{t}\rho^a)} +
     \left\| \LL \zeta (t)\right\|_{ B^{- 1 - \kappa}_{2, 2} (\pi_{t} \rho^c)}
     \lesssim \|\zeta(t) \|_{B^{1 - \beta}_{2, 2} (\pi_{t})} + \| \eta(t)
     \|_{ B^{1 + \beta}_{2, 2} (\pi_{t})} . \]
  Finally we have all in hand to conclude. Choosing $\delta$ sufficiently
  small allows to absorb the two terms in
  (\ref{eq:s}) into the left hand side and by Gronwall's lemma we absorb the terms in \eqref{eq:s1}. Accordingly $\zeta \equiv \eta \equiv
  0$ on $[0, T]$ and the proof is complete.
\end{proof}

\section{Coming down from infinity}
\label{s:d}

The goal of this section is to establish refined a priori estimates for solutions to the parabolic $\Phi^{4}$ model in dimension 2 and 3, which are valid independently of the initial condition. In particular this shows  that the solutions come down from infinity.

For the purposes of this section, we introduce a further time weight of the form $\tau (t) = 1 - e^{- t}$ so
that $\partial_t \tau (t) =  e^{- t} = 1 - \tau (t)$.
First of all, we prepare the initial data as follows. Let $\varphi_{0}\in \CC^{ - 1 + \varepsilon}
     (\rho^{1+\varepsilon}_0)$ for some $\varepsilon>0$.  Let $L>0$ be such that 
  \[ 2^{\varepsilon L} \sim \| \varphi_0 \|_{\CC^{ - 1 + \varepsilon}
     (\rho^{1+\varepsilon}_0)} . \]
Define 
$$\phi_{0}:=\UU_{>}
 \varphi_{0}-X(0),\qquad \psi_{0}:=\UU_{\leq}\varphi_{0},$$
 where $\UU_{>},\UU_{\leq}$ are the localizers corresponding to $L$. We recall that $X$ was chosen stationary and $X(0)\in \CC^{-1/2-\kappa}(\rho^{\sigma}_{0})$ for any $\sigma>0$  (see Theorem \ref{thm:renorm43}). Then it follows from Lemma \ref{lem:local} that
  \begin{equation}\label{eq:phi0}
  \|  \phi_0 \|_{\CC^{ - 1} (\rho_0)} \lesssim 1 
  \end{equation}
  uniformly over $\varphi_0 \in \CC^{ - 1 + \varepsilon} (\rho^{1+\varepsilon}_0)$ and $\varepsilon>0$.
Now we have all in hand to formulate the main result of this section.

\begin{theorem}
  \label{th:come-down-3}
  Let $\kappa, \alpha\in (0,1)$ be chosen sufficiently small and let $\gamma=\alpha-\kappa>0$. Let $\varphi_{0}\in \CC^{ - 1 + \varepsilon}
     (\rho^{1+\varepsilon}_0)$ for some $\varepsilon>0$. Let $(\phi, \psi)$ be a solution to the parabolic problem \eqref{eq:two43}, \eqref{eq:th43}
  in $d = 3$ with initial condition $(\phi_{0},\psi_{0})$ defined above.  
  Then, uniformly in $\varphi_{0}$,
  \[ \phi \in C \CC^{\alpha} (\tau^{\frac{1}{2}} \rho) \cap C \CC^{\frac{1}{2}
     + \alpha} ((\tau^{\frac{1}{2}} \rho)^{\frac{3}{2} + \alpha}), \]
  and
  \[ \psi \in C \CC^{2 + \gamma} ((\tau^{\frac{1}{2}} \rho)^{3 + \gamma}) \cap
     L^{\infty} L^{\infty} (\tau^{\frac{1}{2}} \rho) . \]
\end{theorem}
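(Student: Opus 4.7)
The plan is to run the a priori estimate scheme of Sections~\ref{sec:45} and~\ref{sec:43} throughout with the time-dependent weight $\tau^{1/2}\rho$ in place of $\rho$. Two features of $\tau(t)=1-e^{-t}$ make this work: first, $\tau(0)=0$ kills all initial-data contributions at $t=0$, so any estimate that closes in norms involving $\tau^{1/2}$ will automatically be uniform in $\varphi_0$; second, the identity $\partial_t\tau=1-\tau$ means that differentiating the weight produces a positive term that can be absorbed into the cubic drift. The refined Schauder and coercive estimates of Sections~\ref{ss:d1},~\ref{ss:d2} and Appendix~\ref{s:sch} are designed to propagate precisely this singular-in-time weight.

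\textbf{Linear part.} First I would bound $\phi$. Since $\phi$ solves the linear equation $\LL\phi+\Phi=0$ with $\phi(0)=\phi_0$, and $\|\phi_0\|_{\CC^{-1}(\rho_0)}\lesssim 1$ uniformly in $\varphi_0$ by~\eqref{eq:phi0}, the refined parabolic Schauder estimate applied in the spaces $\CC^{\alpha}(\tau^{1/2}\rho)$ will control $\phi$ by $\Phi$. Then, repeating the localization argument of Sections~\ref{ssec:phi1}--\ref{ssec:phi2}, but now choosing $K$ so that $1+\|\phi+\psi\|_{L^{\infty}L^{\infty}(\tau^{1/2}\rho)}\sim 2^{(1-\alpha-\kappa)K}$, yields
\[
\|\phi\|_{C\CC^{\alpha}(\tau^{1/2}\rho)}\lesssim 1,\qquad \|\phi\|_{C\CC^{1/2+\alpha}((\tau^{1/2}\rho)^{3/2+\alpha})}\lesssim 1+\|\psi\|_{L^{\infty}L^{\infty}(\tau^{1/2}\rho)}^{\varepsilon},
\]
with $\varepsilon\in(0,1)$ independent of the data. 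The time weight is compatible with the Schauder argument because the $\Phi$-terms are polynomial in $\phi+\psi$ and the stochastic objects are stationary.

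\textbf{Paracontrolled and regular parts.} Next I would bound $\vartheta$ in $C\CC^{1+\alpha}((\tau^{1/2}\rho)^{3+\gamma'})$, mirroring Section~\ref{ssec:theta} with the modified paraproduct controlling the commutator $[\LL,(-\tthreeone{X}+\phi+\psi)\Prec]\ttwoone{X}$; here the refined Schauder estimate of Appendix~\ref{s:sch} is essential to absorb the $\tau^{1/2}$ weight against the limited time regularity of $\phi+\psi$ (one loses the interpolation-in-time step used in Section~\ref{sec:43}, which must be compensated by a higher power of the space weight). Combining with Lemma~\ref{lemma:schauder-par} adapted to the time weight then gives, as in Section~\ref{ssec:psi1},
\[
\|\psi\|_{C\CC^{2+\gamma}((\tau^{1/2}\rho)^{3+\gamma})}\lesssim 1+\|\psi\|_{L^{\infty}L^{\infty}(\tau^{1/2}\rho)}^{3+\gamma}.
\]

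\textbf{Coercive estimate — main obstacle.} The decisive step is to bound $\psi$ in $L^{\infty}L^{\infty}(\tau^{1/2}\rho)$ independently of $\varphi_0$. I would apply the maximum principle to $\bar\psi:=\tau^{1/2}\rho\,\psi$: at an interior global maximum $(t^*,x^*)$ with $t^*>0$ one has $\nabla\bar\psi=0$, $-\Delta\bar\psi\geq 0$, and
\[
0=\partial_t\bar\psi=\tfrac{1}{2}\tau^{-1/2}(1-\tau)\rho\psi+\tau^{1/2}\rho\,\partial_t\psi,
\]
so substituting $\partial_t\psi$ from the equation and multiplying by $\tau\rho^{2}$ leads, after rearrangement, to an inequality of the form
\[
\bar\psi^3\lesssim \tau^{3/2}\rho^{3}\|\Psi\|_{L^{\infty}((\tau^{1/2}\rho)^{3})}+C_{\rho,\mu}\bar\psi+\tfrac{1}{2}\tau^{1/2}(1-\tau)\rho^{2}\psi
\]
at $(t^*,x^*)$. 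Since $\tau^{1/2}(1-\tau)\leq 1$, the last term is bounded by $C_{\rho}\bar\psi$ and can be absorbed; combining with $\|\Psi\|_{L^{\infty}L^{\infty}((\tau^{1/2}\rho)^3)}\lesssim 1+\|\psi\|_{L^{\infty}L^{\infty}(\tau^{1/2}\rho)}^{2+\varepsilon}$ from the previous step and the weighted Young inequality yields $\|\psi\|_{L^{\infty}L^{\infty}(\tau^{1/2}\rho)}\lesssim 1$. The case $t^*=0$ is trivial because $\bar\psi(0,\cdot)=0$, and the failure of attainment of a global maximum is handled by the $\delta$-perturbation trick of Lemma~\ref{lemma:apriori-parabolic}. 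The main obstacle is the appearance of the potentially singular factor $\tau^{-1/2}$ coming from $\partial_t\tau^{1/2}$: matching the time-power counting so that the cubic $\bar\psi^3$ dominates this singularity near $t=0$ is what dictates the exact choice of exponents and weights, and this is where the refined Schauder estimates of Appendix~\ref{s:sch} are crucial.
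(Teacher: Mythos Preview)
Your coercive estimate is correct and matches the paper's Lemma~\ref{lemma:weighted-coercive}, and the overall strategy of rerunning the a~priori scheme with the weight $\tau^{1/2}\rho$ is right. But two steps are genuinely incomplete.

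\textbf{The $\phi$-bound does not close as stated.} The refined Schauder estimate (Lemma~\ref{lemma:tau-weighted-schauder}) outputs $\phi\in C\CC^{\alpha}(\tau^{(1+\alpha)/2}\rho)$, with $\|\phi\|_{L^\infty L^\infty(\tau^{1/2}\rho)}$ appearing on the right-hand side; it does not directly give $\phi\in C\CC^{\alpha}(\tau^{1/2}\rho)$. Since $(1+\alpha)/2>1/2$, this is strictly weaker near $t=0$ and one cannot absorb. The paper fills this gap by a separate argument: for each $t$ it chooses $k$ with $2^{-2k}\sim\lambda^{2/\alpha}\tau(t)$, splits $\phi(t)=\Delta_{\le k}\phi(t)+\Delta_{>k}\phi(t)$, and uses the Duhamel formula together with $\|\phi_0\|_{\CC^{-1}(\rho_0)}\lesssim 1$ to bound the low-frequency part, while the high-frequency part is controlled by the $C\CC^{\alpha}(\tau^{(1+\alpha)/2}\rho)$ norm with a small prefactor $\lambda$. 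Only after this extra step does one obtain $\|\phi\|_{L^\infty L^\infty(\tau^{1/2}\rho)}\lesssim 1$ and close the loop with~\eqref{eq:down-phi}.

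\textbf{The paracontrolled ansatz has to be changed, not just reweighted.} Mirroring Section~\ref{ssec:theta} with the ansatz~\eqref{eq:th43} requires, via Lemma~\ref{lem:5.1}, the time-H\"older norm $\|\phi+\psi\|_{C^{(\alpha+\kappa)/2}L^\infty}$ in a $\tau$-weighted space. But $\tau$ is not admissible in time (it vanishes at $0$), the equivalence~\eqref{eq:teq} fails, and no time-interpolation in the sense of Lemma~\ref{lemma:interp2} is available; compensating by a higher power of the spatial weight does not repair this. The paper's device is a genuinely new ansatz
\[
\phi=\breve\vartheta-\tau^{-\frac{1+\nu}{2}}\Big(3\big[\tau^{\frac{1+\nu}{2}}(-\tthreeone{X}+\phi+\psi)\big]\precprec\ttwoone{X}\Big),
\]
which moves the $\tau$-power \emph{inside} the modified paraproduct. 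The resulting commutators then require only the time regularity of $\tau^{(1+\nu)/2}(\phi+\psi)$ in the admissible weight $\rho^{3+\gamma''}$, and this is supplied by the time-$\tau$ Schauder estimate Lemma~\ref{lemma:time-tau-schauder} (choosing $\nu=1/2+\alpha$). Without this modification the commutator bound in your $\vartheta$-step does not go through.
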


An analogous result holds true also in dimension 2. In this case, we construct the initial condition in the same way and obtain the following result.

\begin{theorem}
  \label{th:come-down-2}
  Let $\kappa, \alpha\in (0,1)$ be chosen sufficiently small and let $\beta=\alpha-\kappa>0$. Let $(\phi, \psi)$ a solution of the parabolic problem \eqref{eq:42b}
  in $d = 2$ with initial condition $(\phi_{0},\psi_{0})$ defined above. Then
  \[ (\phi, \psi) \in C \CC^{\alpha} (\tau^{\frac 12} \rho) \times [C \CC^{2
     + \beta} ((\tau^{ \frac 12} \rho)^{3 + \beta}) \cap L^{\infty}
     L^{\infty} (\tau^{\frac 12} \rho)] \]
  uniformly in the initial condition.
\end{theorem}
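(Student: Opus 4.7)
The plan is to follow the decomposition of Section~\ref{sec:42} for the $d=2$ parabolic case. With the ansatz $\varphi = X + \phi + \psi$, this produces the system $\LL\phi + \Phi = 0$, $\LL\psi + \psi^3 + \Psi = 0$, where $\Phi, \Psi$ are as in \eqref{eq:44c} with the parabolic localizers $\VV_>, \VV_\leq$ replacing the elliptic ones. The preparation of the initial condition at the start of Section~\ref{s:d} ensures $\|\phi_0\|_{\CC^{-1}(\rho_0)} \lesssim 1$ uniformly in $\varphi_0$, at the price that $\psi_0 = \UU_{\leq}\varphi_0$ has a regular but potentially large norm controlled by a polynomial of $\|\varphi_0\|_{\CC^{-1+\varepsilon}(\rho_0^{1+\varepsilon})}$. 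The key novelty compared to Theorem~\ref{thm:ex42} is the introduction of the time weight $\tau^{1/2}$, which vanishes at $t=0$ and is responsible for absorbing both the short-time singularity coming from the low regularity of $\phi_0$ and the potentially unbounded values of $\psi$ at the initial time.

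For the $\phi$ equation I would apply the refined Schauder estimate proved in Appendix~\ref{s:sch} to the distributional initial datum $\phi_0 \in \CC^{-1}(\rho_0)$, yielding an estimate of the form
\[
  \|\phi\|_{C\CC^{\alpha}(\tau^{1/2}\rho)} \lesssim \|\phi_0\|_{\CC^{-1}(\rho_0)} + \|\Phi\|_{L^\infty\CC^{\alpha-2}(\tau^{1/2}\rho)}.
\]
The right-hand side is then bounded by the localizer arguments of Section~\ref{ssec:apr} applied pointwise in time with $\rho$ replaced by $\tau^{1/2}\rho$; since $\tau^{1/2}\rho \leq \rho$ the probabilistic bounds on the stochastic objects remain valid, and the choice $L \sim \log(1 + \|\phi+\psi\|_{L^\infty L^\infty(\tau^{1/2}\rho)})$ closes the $\phi$-bound in terms of $\psi$ only.

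The heart of the proof is a modified maximum principle replacing Lemma~\ref{lemma:apriori-parabolic}. Setting $\bar\psi(t,x) := \tau^{1/2}(t)\rho(x)\psi(t,x)$, the crucial observation is that $\bar\psi(0,\cdot) \equiv 0$, so any global maximum of $\bar\psi$ is attained either trivially or at some $(t^*, x^*)$ with $t^* > 0$. At such an interior point we have $\partial_t\bar\psi = 0$, $\nabla\bar\psi = 0$ and $\Delta\bar\psi \leq 0$, and computing as in the proof of Lemma~\ref{lemma:apriori-parabolic} the extra term coming from $\partial_t\tau^{1/2} = \tfrac{1-\tau}{2\tau^{1/2}}$ contributes a coefficient $\tfrac{1-\tau}{2\tau} \cdot (\tau^{1/2}\rho)^2 = \tfrac{(1-\tau)\rho^2}{2}$ multiplying $\bar\psi$, which is \emph{bounded uniformly} in $t \in [0,\infty)$ and $x \in \R^2$. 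This linear perturbation is harmlessly absorbed by the cubic, yielding
\[
  \|\psi\|_{L^\infty L^\infty(\tau^{1/2}\rho)} \lesssim 1 + \|\Psi\|_{L^\infty L^\infty((\tau^{1/2}\rho)^3)}^{1/3}
\]
independently of $\psi_0$, which is the coming-down-from-infinity mechanism and the only place where the positive sign of the cubic nonlinearity enters.

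The remainder of the argument is a direct adaptation of the closure scheme of Section~\ref{ssec:apr}: one bounds $\Psi$ in the relevant weighted spaces in terms of $\psi$, uses the interpolation inequalities of Lemma~\ref{lemma:interp2} to control intermediate norms by $\|\psi\|_{L^\infty L^\infty(\tau^{1/2}\rho)}$ and the Schauder output $\|\psi\|_{C\CC^{2+\beta}((\tau^{1/2}\rho)^{3+\beta})}$, and iterates to obtain an absolute bound. The hard part will be the refined Schauder estimate of Appendix~\ref{s:sch}: one needs to verify that the time weight $\tau^{1/2}$ is precisely matched to the short-time blowup rate of the heat semigroup acting on a $\CC^{-1}$ initial datum, which presumably requires splitting into short/long time regimes and carefully redistributing the regularity gain between the semigroup and the weight. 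Once this is in place the remaining arithmetic on powers of $\|\psi\|_{L^\infty L^\infty(\tau^{1/2}\rho)}$ proceeds exactly as in the existence proof.
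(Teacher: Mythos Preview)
Your proposal is essentially correct and follows the pattern the paper indicates (the paper does not give a proof of this theorem, stating only that it ``follows the same pattern'' as the $d=3$ case of Theorem~\ref{th:come-down-3} but is simpler because no paracontrolled ansatz is needed). Your identification of the decomposition, the initial-data preparation, the $\tau^{1/2}$-weighted maximum principle (this is exactly Lemma~\ref{lemma:weighted-coercive}), and the closure scheme are all on target.

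One point of imprecision: the Schauder estimate you write,
\[
  \|\phi\|_{C\CC^{\alpha}(\tau^{1/2}\rho)} \lesssim \|\phi_0\|_{\CC^{-1}(\rho_0)} + \|\Phi\|_{L^\infty\CC^{\alpha-2}(\tau^{1/2}\rho)},
\]
cannot hold as stated, because $P_t$ acting on $\CC^{-1}$ data produces $\CC^{\alpha}$ with blow-up $t^{-(1+\alpha)/2}$, and $\tau^{1/2}\sim t^{1/2}$ does not compensate this. The paper's $d=3$ argument (which you should mirror) is a \emph{two-step} procedure: first Lemma~\ref{lemma:tau-weighted-schauder} gives
\[
  \|\phi\|_{C\CC^{\alpha}(\tau^{(1+\alpha)/2}\rho)} \lesssim \|\phi\|_{L^{\infty}L^{\infty}(\tau^{1/2}\rho)} + \|\Phi\|_{C\CC^{\alpha-2}(\tau^{(1+\alpha)/2}\rho)},
\]
with the correct weight $\tau^{(1+\alpha)/2}$ on the $\CC^{\alpha}$ side; then a separate Littlewood--Paley splitting argument (choosing $2^{-2k}\sim\lambda^{2/\alpha}\tau(t)$ and using the semigroup on low frequencies, the $\CC^{\alpha}$ bound on high frequencies) bounds $\|\phi\|_{L^{\infty}L^{\infty}(\tau^{1/2}\rho)}$ in terms of $\|\phi_0\|_{\CC^{-1}(\rho_0)}$, $\|\Phi\|$, and $\lambda\|\phi\|_{C\CC^{\alpha}(\tau^{(1+\alpha)/2}\rho)}$; choosing $\lambda$ small closes this loop. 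Your remark that ``the hard part'' is matching the weight to the semigroup blow-up via a short/long time split is exactly right---it just does not collapse into the single inequality you wrote.
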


In the following, we discuss the necessary preliminary results and finally prove Theorem~\ref{th:come-down-3} in Section \ref{ss:thm91}. The proof of
Theorem~\ref{th:come-down-2} will not be given since it is substantially simpler
(as one does not need the paracontrolled ansatz) and follows the same pattern.
As a corollary,  we obtain that
\[ \| \phi (t) + \psi (t) \|_{L^{\infty}(\rho)} \lesssim 1 + t^{- 1 / 2} \]
independently of the initial condition: the solution comes down from infinity in a
finite time. This has been first observed by~\cite{MWcomedown} in the periodic setting.

\begin{remark}
We point out that the assumption on the regularity of initial condition in Theorems \ref{th:come-down-3}, \ref{th:come-down-2} is very weak and the existence for such singular initial conditions is not guaranteed  by the respective existence results, Theorems \ref{thm:ex3d}, \ref{thm:ex42}. However, for instance in case of the $\Phi^{4}$ model  \eqref{eq:phi4p} on $\mathbb{T}^{3}$, if $\varphi_{0}$ belongs to the natural space $\CC^{-1/2-\kappa}$, one may use the short time existence of a unique solution from \cite{CC} together with Theorem \ref{th:come-down-3} to deduce global existence and the coming down from infinity property. Furthermore, revisiting the proof of our a~priori estimates we see that the proportionality constants depend polynomially on the noise, which implies integrability of all the moments. This way, we recover the result of~\cite{MWcomedown}.
\end{remark}

\subsection{Interpolation and localization}
\label{ss:d0}

First, we notice that an interpolation similar to Lemma \ref{lemma:interp2} remains valid and the proof follows the same lines.

\begin{lemma}
  \label{lemma:tau-space-interpolation}For $\alpha \in [0, 2 + \kappa]$ we
  have
  \[ \| \tau^{ (1 + \alpha) / 2} \psi \|_{C \CC^{\alpha} (\rho^{1 + \alpha})}
     \lesssim \| \tau^{1 / 2} \psi \|_{C L^{\infty} (\rho)}^{1 - \alpha / (2 + 
     \kappa)} \| \tau^{(3 + \kappa) / 2} \psi \|_{C \CC^{2 + \kappa} (\rho^{3
     + \kappa})}^{\alpha / (2 + \kappa)}, \]
  or more generally
  \[ \| \tau^{ (1 + \alpha) / 2} \psi \|_{C \CC^{\alpha} (\rho^{1 + \alpha})}
     \lesssim \| \tau^{(1 + \delta) / 2} \psi \|_{C \CC^{\delta} (\rho^{1 +
     \delta})}^{1 - \theta} \| \tau^{(3 + \kappa) / 2} \psi \|_{C \CC^{2 +
     \kappa} (\rho^{3 + \kappa})}^{\theta}, \]
  whenever $\delta \in [0, \alpha]$ and $\alpha = (1 - \theta) \delta + \theta
  (2 + \kappa)$ for some $\theta \in [0, 1]$.
\end{lemma}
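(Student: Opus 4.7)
The plan is to follow essentially the same argument as in Lemma \ref{lemma:interp2}, applied to Littlewood--Paley blocks pointwise in time, with the new ingredient being that the time weight $\tau(t)^{(1+\alpha)/2}$ factors multiplicatively in exactly the same way as $\rho^{1+\alpha}$. I would first verify the crucial algebraic identities: with $\theta = \alpha/(2+\kappa)$, one checks
\[
\rho^{1+\alpha} = \rho^{(1-\theta) + \theta(3+\kappa)}, \qquad \tau^{(1+\alpha)/2} = \tau^{(1-\theta)/2 + \theta(3+\kappa)/2},
\]
so that at every point $(t,x)$ one has the pointwise identity
\[
\tau(t)^{(1+\alpha)/2}\rho(x)^{1+\alpha}|\Delta_k \psi(t,x)| = \bigl(\tau(t)^{1/2}\rho(x)|\Delta_k \psi(t,x)|\bigr)^{1-\theta}\bigl(\tau(t)^{(3+\kappa)/2}\rho(x)^{3+\kappa}|\Delta_k \psi(t,x)|\bigr)^{\theta}.
\]

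From here the argument is the same as in the proof of Lemma \ref{lemma:interp}: taking the $L^{\infty}$-norm in $x$, using that the geometric mean of $L^\infty$-norms bounds the $L^\infty$-norm of the geometric mean, and then using $\|\tau^{1/2}\rho\,\Delta_k\psi(t)\|_{L^\infty} \lesssim \|\tau^{1/2}\psi\|_{CL^\infty(\rho)}$ together with the Besov estimate $\|\tau^{(3+\kappa)/2}\rho^{3+\kappa}\Delta_k\psi(t)\|_{L^\infty} \lesssim 2^{-(2+\kappa)k}\|\tau^{(3+\kappa)/2}\psi\|_{C\CC^{2+\kappa}(\rho^{3+\kappa})}$, yields
\[
2^{\alpha k}\|\tau^{(1+\alpha)/2}\Delta_k\psi(t)\|_{L^\infty(\rho^{1+\alpha})} \lesssim 2^{\alpha k - (2+\kappa)\theta k}\|\tau^{1/2}\psi\|_{CL^\infty(\rho)}^{1-\theta}\|\tau^{(3+\kappa)/2}\psi\|_{C\CC^{2+\kappa}(\rho^{3+\kappa})}^{\theta}.
\]
The choice of $\theta$ makes the exponent of $2^k$ vanish, so taking $\sup_{k \geq -1}$ and then $\sup_{t \geq 0}$ gives the first displayed inequality.

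For the general statement with $\delta \in [0,\alpha]$ and $\alpha = (1-\theta)\delta + \theta(2+\kappa)$, the same computation applies with $(\tau^{1/2},\rho)$ replaced by $(\tau^{(1+\delta)/2},\rho^{1+\delta})$: one verifies again
\[
\rho^{1+\alpha} = \rho^{(1-\theta)(1+\delta) + \theta(3+\kappa)}, \qquad \tau^{(1+\alpha)/2} = \tau^{(1-\theta)(1+\delta)/2 + \theta(3+\kappa)/2},
\]
and the $k$-exponent bookkeeping $\alpha = (1-\theta)\delta + \theta(2+\kappa)$ again makes the power of $2^k$ cancel. No genuine obstacle arises here; the whole point is just that the time weight $\tau$ enters multiplicatively and can be handled in parallel with the space weight $\rho$, so the structural argument of Lemma \ref{lemma:interp2} carries over verbatim once the exponent identities are checked.
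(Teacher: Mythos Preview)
Your proposal is correct and is exactly the approach the paper indicates: the paper simply states that the proof follows the same lines as Lemma~\ref{lemma:interp2} (which in turn is the pointwise-in-time version of Lemma~\ref{lemma:interp}), and you have spelled out precisely that argument, with the only addition being that the scalar time weight $\tau(t)$ factors through the exponents in parallel with the space weight $\rho$.
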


We stress that unlike Lemma \ref{lemma:interp2}, we do not include any interpolation in terms of time regularity into Lemma \ref{lemma:tau-space-interpolation}. Indeed, since the weight $\tau$ vanishes at zero, the equivalence  \eqref{eq:teq} is not valid anymore for the  corresponding weighted H\"older  spaces (in time). Therefore we proceed differently than in  Section \ref{sec:43}: below, we introduce a new modified paracontrolled ansatz which eventually leads us to the requirement $\tau^{\beta}(\phi+\psi)\in C^{{\delta}}L^{\infty}(\rho^{\sigma})$, for certain $\beta,\delta,\sigma>0$. In other words, instead of time regularity of $\phi+\psi$ in a space weighted by $\tau^{\beta}\rho^{\sigma}$, we require time regularity of $\tau^{\beta}(\phi+\psi)$ in a space weighted by an admissible space-time weight $\rho^{\sigma}$, which falls in the framework of Section \ref{ssec:besov}.

We will also need the parabolic localization \eqref{eq:locd} together with Lemma \ref{lem:local2}. However, since this is only applied to the stochastic objects that do not require any $\tau$ weight, no $\tau$-adapted version of Lemma \ref{lem:local2} is needed.

\subsection{Weighted Schauder estimates}
\label{ss:d1}

In this section we formulate new Schauder estimates  adapted to the particular weight $\tau$ which is not bounded away from zero. 
In particular, with the interpolation in hand, we may  employ Lemma~\ref{lemma:basic-tau-schauder} to deduce the following.

\begin{lemma}
  \label{lemma:tau-weighted-schauder}
   Let $\alpha
  > - 2$, $\gamma = (3 + \alpha) / 2$ and $\beta_i \in [0, 2)$. Assume that
  $\LL v = \sum_i V_i$. Then the following a priori estimate holds true
  \[ \| v \|_{C \CC^{2 + \alpha} (\tau^{\gamma} \rho)} \lesssim \| v \|_{C
     \CC^{\delta} (\tau^{(1 + \delta) / 2} \rho)} + \sum_i \| \tau^{\beta_i /
     2} V_i \|_{C \CC^{\alpha + \beta_i} (\tau^{\gamma} \rho)}, \]
  whenever $\delta \in [0, 1 + \alpha]$ is given by $1 + \alpha = (1 - \theta)
  \delta + \theta (2 + \alpha)$ for some $\theta \in [0, 1]$. Consequently, it
  also holds
  \[ \| v \|_{C \CC^{2 + \alpha} (\tau^{\gamma} \rho)} \lesssim \| v
     \|_{L^{\infty} L^{\infty} (\tau^{1 / 2} \rho)} + \sum_i \| \tau^{\beta_i
     / 2} V_i \|_{C \CC^{\alpha + \beta_i} (\tau^{\gamma} \rho)} . \]
\end{lemma}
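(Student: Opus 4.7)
The plan is to reduce the multi-forcing bound to the basic single-forcing weighted Schauder estimate of Lemma~\ref{lemma:basic-tau-schauder} via linearity of $\LL$, and then pass from the first to the second inequality using the interpolation in Lemma~\ref{lemma:tau-space-interpolation} together with the weighted Young inequality. Because $\tau(t) = 1 - e^{-t}$ vanishes at $t=0$, the basic estimate trades initial-datum information for a lower-order norm of $v$ on the right-hand side, which is precisely what makes the bound independent of $v(0)$.

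For the first inequality, I would decompose $v = v_h + \sum_i v_i$ linearly, with $\LL v_h = 0$, $v_h(0) = v(0)$, and $\LL v_i = V_i$, $v_i(0) = 0$. Applying Lemma~\ref{lemma:basic-tau-schauder} to each piece produces a bound of the form
\[
\|v_i\|_{C\CC^{2+\alpha}(\tau^\gamma\rho)} \lesssim \|\tau^{\beta_i/2}V_i\|_{C\CC^{\alpha+\beta_i}(\tau^\gamma\rho)} + \|v_i\|_{C\CC^\delta(\tau^{(1+\delta)/2}\rho)}
\]
and a homogeneous analogue for $v_h$. Summing and using the triangle inequality, together with the elementary Duhamel bounds $\|v_i\|_{C\CC^\delta(\tau^{(1+\delta)/2}\rho)} \lesssim \|\tau^{\beta_i/2}V_i\|_{C\CC^{\alpha+\beta_i}(\tau^\gamma\rho)}$ to collapse the lower-order contributions from each $v_i$ back into the forcing terms, would yield the first inequality.

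For the second inequality, I would apply a variant of Lemma~\ref{lemma:tau-space-interpolation}---obtained by the same H\"older interpolation on Littlewood--Paley blocks, but keeping the polynomial weight $\rho$ unscaled---to interpolate between the endpoint spaces $L^\infty L^\infty(\tau^{1/2}\rho)$ and $C\CC^{2+\alpha}(\tau^{(3+\alpha)/2}\rho)$:
\[
\|v\|_{C\CC^\delta(\tau^{(1+\delta)/2}\rho)} \lesssim \|v\|_{L^\infty L^\infty(\tau^{1/2}\rho)}^{1-\theta}\|v\|_{C\CC^{2+\alpha}(\tau^\gamma\rho)}^{\theta}, \qquad \theta = \tfrac{\delta}{2+\alpha}.
\]
The weighted Young inequality then produces $\|v\|_{C\CC^\delta(\tau^{(1+\delta)/2}\rho)} \le C\|v\|_{L^\infty L^\infty(\tau^{1/2}\rho)} + \tfrac{1}{2}\|v\|_{C\CC^{2+\alpha}(\tau^\gamma\rho)}$; substituting into the first inequality and absorbing the higher-order term into the left-hand side yields the second estimate.

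The main obstacle is the precise formulation of Lemma~\ref{lemma:basic-tau-schauder} (deferred to Appendix~\ref{s:sch}). One has to quantitatively balance the vanishing time weight on the forcing, $\tau^{\beta_i/2}$, against that on the solution, $\tau^{(3+\alpha)/2}$, accepting a loss of $\beta_i$ derivatives so that the net regularity gain $2-\beta_i$ remains strictly positive---which is exactly what makes the hypothesis $\beta_i \in [0,2)$ sharp. The interpolation variant invoked in the second step is essentially automatic from the proof strategy of Lemma~\ref{lemma:tau-space-interpolation}, so no new technical input is needed there.
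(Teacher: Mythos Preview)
Your decomposition $v = v_h + \sum_i v_i$ does not work, because it rests on a misreading of Lemma~\ref{lemma:basic-tau-schauder}. That lemma is an \emph{unweighted} estimate
\[
\|w\|_{C\CC^{2+\alpha}} \lesssim \|\tau^{\beta/2}\LL w\|_{C\CC^{\alpha+\beta}}
\]
valid only when $w(0)=0$; it carries no spatial weight $\rho$, no $\tau^\gamma$ on the left, and no lower-order term on the right. So you cannot apply it to $v_h$ (which has $v_h(0)=v(0)\neq 0$ in general), and the weighted bound you write down for each $v_i$, with the extra $\|v_i\|_{C\CC^\delta(\tau^{(1+\delta)/2}\rho)}$ term, is not what the basic lemma gives --- it is essentially a restatement of what you are trying to prove.

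The paper's mechanism is different and is precisely what your proposal is missing: one multiplies $v$ by $\tau^\gamma\rho$ so that the product vanishes at $t=0$ regardless of $v(0)$, and then applies the basic lemma to $\tau^\gamma\rho v$. Expanding $\LL(\tau^\gamma\rho v)$ produces, besides $\tau^\gamma\rho\sum_i V_i$, commutator terms from derivatives of $\rho$ (harmless, of lower order) and the crucial term $\gamma\tau^{-1}(1-\tau)\tau^\gamma\rho v$ coming from $\partial_t\tau^\gamma$. It is this last term --- bounded in $C\CC^{2+\alpha'}(\tau^{(3+\alpha')/2}\rho)$ for some $\alpha'<\alpha$ --- that forces the lower-order norm $\|v\|_{C\CC^\delta(\tau^{(1+\delta)/2}\rho)}$ onto the right-hand side after interpolation (Lemma~\ref{lemma:tau-space-interpolation}) and absorption. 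Your interpolation step for the second inequality is fine in spirit, but the first inequality cannot be reached by linear decomposition alone; the weight-commutator computation is the actual content.
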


\begin{proof}
First observe that
  \[ (\partial_t + \mu - \Delta) (\tau^{\gamma} \rho v) = \tau^{\gamma} \rho \sum_i V_i
     - \left( \frac{\Delta \rho}{\rho} \right) \tau^{\gamma} \rho v - 2
     \frac{\nabla \rho}{\rho} \nabla (\tau^{\gamma} \rho v) + 2 \left(
     \frac{\nabla \rho}{\rho} \right)^2 \tau^{\gamma} \rho v  \]
  \[ + \left(
     \frac{\partial_t \rho}{\rho} \right) \tau^{\gamma} \rho v+ \gamma \tau^{- 1} (1 - \tau) \tau^{\gamma} \rho v. \]
\rmbb{  By a slight modification of Lemma~\ref{lemma:basic-tau-schauder} where we choose different $\beta$ for different terms, we deduce
  $$
  \| \tau^{\gamma} \rho v \|_{C \CC^{2 + \alpha}}\lesssim \sum_i \| \tau^{\beta_i /
     2} V_i \|_{C \CC^{\alpha + \beta_i} (\tau^{\gamma} \rho)} + \| \gamma \tau^{- 1} (1 - \tau) \tau^{\gamma} \rho v
     \|_{C \CC^{2 +
     \alpha'} (\tau^{(2 - \alpha + \alpha') / 2})}
 $$
 $$  
 + \left\| -\left( \frac{\Delta \rho}{\rho} \right) \tau^{\gamma} \rho v - 2
     \frac{\nabla \rho}{\rho} \nabla (\tau^{\gamma} \rho v) + 2 \left(
     \frac{\nabla \rho}{\rho} \right)^2 \tau^{\gamma} \rho v +\left(
     \frac{\partial_t \rho}{\rho} \right) \tau^{\gamma} \rho v \right\|_{C \CC^{ \alpha}} 
  $$}
  \[  \lesssim  \sum_i \| \tau^{\beta_i /
     2} V_i \|_{C \CC^{\alpha + \beta_i} (\tau^{\gamma} \rho)} + \| \tau^{\gamma - 1} \rho v \|_{C \CC^{2 +
     \alpha'} (\tau^{(2 - \alpha + \alpha') / 2})} + \| \tau^{\gamma} \rho v
     \|_{C \CC^{1 + \alpha}}, \]
  where $\alpha < 2 + \alpha' < 2 + \alpha$. Now using  $\tau^{(3+\alpha)/2}\leq \tau^{(2+\alpha)/2}$ for the last term on the right hand side, we obtain by interpolation from
  Lemma~\ref{lemma:tau-space-interpolation} that 
  \[ \| v \|_{C \CC^{1 + \alpha} (\tau^{(3 + \alpha) / 2} \rho)} + \| v \|_{C
     \CC^{2 + \alpha'} (\tau^{(3 + \alpha') / 2} \rho)} \leqslant C \| v \|_{C
     \CC^{\delta} (\tau^{(1 + \delta) / 2} \rho)} + \frac{1}{2} \| v \|_{C
     \CC^{2 + \alpha} (\tau^{(3 + \alpha) / 2} \rho)} . \]
  This allows us to absorb the residual terms in the left hand side giving the
  final statement.
\end{proof}

Below we need also some specific Schauder estimate for time regularity of
solutions to the heat equation with a precise control of the $\tau$-weights in
the source term. We derive it here.

\begin{lemma}
  \label{lemma:time-tau-schauder}For any $\alpha \in (0, 2)$ and $\beta_i \in
  [0, 2)$ such that $\alpha + \beta_i - 2 < 0$ we have
  \[ \| v \|_{C^{\alpha / 2} L^{\infty} (\rho)} \lesssim \| v \|_{C
     \CC^{\alpha} (\rho)} + \sum_i \| \tau^{\beta_i / 2} V_i \|_{C \CC^{\alpha
     + \beta_i - 2} (\rho)} \]
  where \ $\LL v = \sum_i V_i$.
\end{lemma}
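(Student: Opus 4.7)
The plan is to start from the variation-of-constants formula $v(t) - v(s) = (P_{t-s} - \mathrm{Id}) v(s) + \sum_i \int_s^t P_{t-r} V_i(r)\,\mathrm{d}r$ for $0 \le s < t$, with $P_h = e^{h(\Delta - \mu)}$. Since the $L^{\infty} L^{\infty}(\rho)$ part of $\|v\|_{C^{\alpha/2} L^{\infty}(\rho)}$ is trivially dominated by $\|v\|_{C\CC^{\alpha}(\rho)}$ via the Besov embedding, only the H\"older seminorm in $t$ needs attention, and for each fixed $s<t$ one must bound the two pieces above in $L^{\infty}(\rho)$ by a constant multiple of $|t-s|^{\alpha/2}$.

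\textbf{Bounding the two pieces.} For the semigroup increment, I would invoke the standard bound $\|(P_h - \mathrm{Id}) w\|_{L^\infty(\rho)} \lesssim h^{\alpha/2}\|w\|_{\CC^\alpha(\rho)}$ already used in the proof of Lemma~\ref{lemma:sch}, which directly gives $\lesssim |t-s|^{\alpha/2}\|v\|_{C\CC^{\alpha}(\rho)}$. For the Duhamel piece, insert $V_i(r) = \tau(r)^{-\beta_i/2}(\tau^{\beta_i/2} V_i)(r)$ and use the smoothing estimate $\|P_{t-r} g\|_{L^\infty(\rho)} \lesssim (t-r)^{(\alpha+\beta_i-2)/2}\|g\|_{\CC^{\alpha+\beta_i-2}(\rho)}$, which is admissible because $\alpha+\beta_i-2 < 0$. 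The whole problem thus reduces to controlling the scalar integral
\[ I_i := \int_s^t (t-r)^{(\alpha+\beta_i-2)/2}\,\tau(r)^{-\beta_i/2}\,\mathrm{d}r \]
by $C\,(t-s)^{\alpha/2}$ uniformly in $0 \le s < t$.

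\textbf{The scalar integral.} The key observation is the monotonicity $\tau(r) \ge \tau(r-s)$ for $r \ge s \ge 0$, which follows from $\tau(r) = 1 - e^{-r}$ being non-decreasing. Shifting $r' = r-s$ yields
\[ I_i \le \int_0^{t-s} ((t-s)-r')^{(\alpha+\beta_i-2)/2}\,\tau(r')^{-\beta_i/2}\,\mathrm{d}r'. \]
For $t - s \le 1$ I would use $\tau(r') \gtrsim r'$ on $[0,1]$ and the Beta identity
\[ \int_0^h (h-r')^{a-1}(r')^{b-1}\,\mathrm{d}r' = h^{a+b-1}\,B(a,b), \]
with $a = (\alpha+\beta_i)/2$ and $b = 1-\beta_i/2$, both strictly positive by the hypotheses $\alpha+\beta_i > 0$ and $\beta_i < 2$; the exponent $a+b-1 = \alpha/2$ gives exactly the right scaling. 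For $t - s > 1$ the H\"older quotient $\|v(t) - v(s)\|_{L^\infty(\rho)}/|t-s|^{\alpha/2}$ is directly bounded by $2\|v\|_{L^\infty L^\infty(\rho)} \le 2\|v\|_{C\CC^\alpha(\rho)}$, already absorbed into the first term on the right-hand side.

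\textbf{Main obstacle.} The single real difficulty is the singularity of $\tau(r)^{-\beta_i/2}$ as $r \downarrow 0$: a naive $L^\infty_t$ estimate on $\|V_i\|_{\CC^{\alpha+\beta_i-2}(\rho)}$ blows up non-integrably. The shift-and-Beta-function argument sketched above is tuned precisely so that both endpoint singularities (at $r'=0$ and $r'=t-s$) remain integrable exactly under the stated hypotheses, producing the desired $(t-s)^{\alpha/2}$ scaling. Summing over $i$ then concludes the proof.
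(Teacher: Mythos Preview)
Your proposal is correct and takes a genuinely different route from the paper. The paper introduces a Littlewood--Paley cutoff at a frequency level $k$ with $2^{-2k}\sim|t-s|$ and splits $v(t)-v(s)$ into three pieces: a low-frequency semigroup increment $\Delta_{\le k}(P_{t-s}-\mathrm{Id})v(s)$, a low-frequency Duhamel integral $\int_s^t\Delta_{\le k}P_{t-u}f(u)\,\mathd u$, and a high-frequency remainder $\Delta_{>k}(v(t)-v(s))$. The point of the cutoff is to trade the semigroup singularity $(t-r)^{(\alpha+\beta_i-2)/2}$ at $r=t$ for a fixed factor $2^{(2-\alpha-\beta_i)k}$, so that the only remaining integral is $\int_s^t\tau(u)^{-\beta_i/2}\,\mathd u$; this they bound by $t^{1-\beta_i/2}-s^{1-\beta_i/2}\lesssim|t-s|^{1-\beta_i/2}$ via H\"older continuity of $t\mapsto t^{1-\beta_i/2}$, splitting further into $t\le 2$ (where $\tau(u)\gtrsim u$) and $t>2$ (where $s\ge 1$ and $\tau$ is harmless).

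You bypass the frequency decomposition entirely, keep the semigroup singularity, and handle both endpoints of the convolution simultaneously. The key device is the shift $r'=r-s$ together with the monotonicity $\tau(r)\ge\tau(r-s)$, which reduces the scalar integral to a genuine Beta integral with exponents $a=(\alpha+\beta_i)/2>0$ and $b=1-\beta_i/2>0$, giving the factor $(t-s)^{\alpha/2}$ in one line. This is more direct and arguably cleaner; the paper's frequency cutoff is not strictly needed here, though it is a standard pattern that reappears throughout the paper (e.g.\ Lemma~\ref{lemma:basic-tau-schauder}) and has the mild advantage of separating the two singularities rather than balancing them.
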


\begin{proof}
  Let $f = (\partial_t + \mu - \Delta) v$ and recall that we denoted by $P_{t}=e^{t(\Delta-\mu)}$ the  semigroup of operators generated by $\Delta-\mu$ with $\mu>0$. Fix $t>s \geqslant t-1$ and let $k \in \mathbbm{N}_0$ be such that $2^{- 2 k} \sim | t
  - s |$. Then using the fact that the weight $\rho$ is nonincreasing in time we obtain
  \[ \|v (t) - v (s) \|_{L^{\infty} (\rho_{t})} \lesssim \| \Delta_{\leqslant k}
     (P_{t - s} - \tmop{Id}) v (s) \|_{L^{\infty} (\rho_{s})} \]
  \[ + \int_s^t \| \Delta_{\leqslant k} P_{t - u} f (u) \|_{L^{\infty} (\rho_{u})}
     \mathd u +  \|\Delta_{>k} (v(t)-v(s)) \|_{ L^{\infty} (\rho_{t})} \backassign I_1
     + I_2 + I_3 \]
  Now
  $$
  I_{3}\lesssim \|\Delta_{>k}v(t) \|_{L^{\infty} (\rho_{t})} +\|\Delta_{>k}v(s) \|_{L^{\infty} (\rho_{s})} \lesssim 2^{- \alpha k} \| v \|_{C \CC^{\alpha} (\rho)},
  $$
  \[ I_1 \lesssim | t - s |^{\alpha / 2} \| v \|_{C \CC^{\alpha} (\rho)}, \]
  and if $t\le 2$ then
  \[ I_2 \lesssim \sum_i 2^{(2 - \alpha - \beta_i) k} \int_s^t \tau (u)^{-
     \beta_i / 2} \mathd u \| \tau^{\beta_i / 2} V_i \|_{C \CC^{\alpha +
     \beta_i - 2} (\rho)} \]
  \[ \lesssim \sum_i 2^{(2 - \alpha - \beta_i) k} 
     \int_s^t u^{- \beta_i / 2} \mathd u \| \tau^{\beta_i / 2} V_i \|_{C
     \CC^{\alpha + \beta_i - 2} (\rho)} \]
  \[ \lesssim | t - s |^{\alpha / 2} \left[ \sum_i \frac{(t^{1 - \beta_i / 2}
     - s^{1 - \beta_i / 2})}{| t - s |^{(1 - \beta_i / 2)}} \| \tau^{\beta_i /
     2} V_i \|_{C \CC^{\alpha + \beta_i - 2} (\rho)} \right], \]
Since the function $t\mapsto t^{\delta}$ is $\delta$-H\"older continuous, it holds true
  \[ \frac{(t^{1 - \beta_i / 2} - s^{1 - \beta_i / 2})}{| t - s |^{(1 -
     \beta_i / 2)}} \lesssim 1, \]
  hence
  \[ I_2 \lesssim | t - s |^{\alpha / 2} \sum_i \| \tau^{\beta_i / 2} V_i
     \|_{C \CC^{\alpha + \beta_i - 2} (\rho)} . \]
  If $t>2$ then $s\geq 1$ and consequently
  \[ I_2 \lesssim \sum_i 2^{(2 - \alpha - \beta_i) k} (t - s) \| \tau^{\beta_i
     / 2} V_i \|_{C \CC^{\alpha + \beta_i - 2} (\rho)} \]
  \[ \lesssim | t - s |^{\alpha / 2} \left[ \sum_i \frac{(t^{} - s^{})}{| t -
     s |^{(1 - \beta_i / 2)}} \| \tau^{\beta_i / 2} V_i \|_{C \CC^{\alpha +
     \beta_i - 2} (\rho)} \right] \lesssim | t - s |^{\alpha / 2} \sum_i \|
     \tau^{\beta_i / 2} V_i \|_{C \CC^{\alpha + \beta_i - 2} (\rho)} . \]
Hence we can conclude that
  \[ \sup_{\substack{s,t\in [0,\8)\\t-1\leq s < t}} \frac{\|v (t) - v (s) \|_{L^{\infty} (\rho_{t})}}{| t - s
     |^{\alpha / 2}} \lesssim \sum_i \| \tau^{\beta_i / 2} V_i \|_{C
     \CC^{\alpha + \beta_i - 2} (\rho)} + \| v \|_{C \CC^{\alpha} (\rho)} \]
and the claim follows.
\end{proof}

Similarly to Lemma \ref{lemma:schauder-par} we finally  derive a Schauder estimate for equations including a cubic nonlinearity.

\begin{lemma}
  \label{lemma:schauder-par1}
Fix $\kappa > 0$ and let $\psi \in C\CC^{2 +
  \kappa} (\rho^{3 + \kappa}) \cap C^{1}L^{\infty} (\rho^{3+\kappa})\cap L^{\infty}L^{\infty}(\rho)$ be a classical solution
  to
  \[ \partial_{t}\psi+(- \Delta + \mu) \psi + \psi^3 = \Psi ,\qquad \psi(0)=\psi_{0}.\]
Then
  \[ \| \psi \|_{C\CC^{2 + \kappa} ((\tau^{1/2}\rho)^{3 + \kappa})} \lesssim  1+ \|
     \Psi \|_{C\CC^{\kappa} ((\tau^{1/2}\rho)^{3 + \kappa})} + \| \psi \|_{L^{\infty}L^{\infty}
     (\tau^{1/2}\rho)}^{3 + \kappa}. \]
\end{lemma}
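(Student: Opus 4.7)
The plan is to mimic the proof of Lemma~\ref{lemma:schauder-par}, replacing the unweighted parabolic Schauder estimate of Lemma~\ref{lemma:sch} by its $\tau$-adapted counterpart Lemma~\ref{lemma:tau-weighted-schauder}. The payoff is that $\tau$ vanishes at $t=0$, so the contribution of the initial datum $\psi_0$ is automatically killed — this is why the claimed bound contains no norm of $\psi_0$.

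First I rewrite the equation as $\LL\psi = \Psi-\psi^3$ and apply Lemma~\ref{lemma:tau-weighted-schauder} with parameters $\alpha=\kappa$, $\gamma=(3+\kappa)/2$, $\delta=0$, $\beta_i=0$, and with its generic weight $\rho$ replaced by $\rho^{3+\kappa}$ (an admissible polynomial weight). Using $\rho\leq 1$ to replace $\rho^{3+\kappa}$ by $\rho$ in the $L^\infty$-term, this yields
\begin{equation*}
\|\psi\|_{C\CC^{2+\kappa}((\tau^{1/2}\rho)^{3+\kappa})}
\lesssim \|\psi\|_{L^\infty L^\infty(\tau^{1/2}\rho)}
+\|\Psi\|_{C\CC^{\kappa}((\tau^{1/2}\rho)^{3+\kappa})}
+\|\psi^3\|_{C\CC^{\kappa}((\tau^{1/2}\rho)^{3+\kappa})}.
\end{equation*}

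The main step is to absorb the cubic term. Applying the paraproduct estimate of Lemma~\ref{lem:para} pointwise in time with the factorization $(\tau^{1/2}\rho)^{3+\kappa}=(\tau^{1/2}\rho)^2\cdot\tau^{(1+\kappa)/2}\rho^{1+\kappa}$, followed by the $\tau$-space interpolation of Lemma~\ref{lemma:tau-space-interpolation}, I obtain
\begin{equation*}
\|\psi^3\|_{C\CC^{\kappa}((\tau^{1/2}\rho)^{3+\kappa})}
\lesssim \|\psi\|_{L^\infty L^\infty(\tau^{1/2}\rho)}^{2}
\,\|\psi\|_{L^\infty L^\infty(\tau^{1/2}\rho)}^{1-\kappa/(2+\kappa)}
\,\|\psi\|_{C\CC^{2+\kappa}((\tau^{1/2}\rho)^{3+\kappa})}^{\kappa/(2+\kappa)}.
\end{equation*}
The weighted Young inequality with conjugate exponents $(2+\kappa)/2$ and $(2+\kappa)/\kappa$ then gives
\begin{equation*}
\|\psi^3\|_{C\CC^{\kappa}((\tau^{1/2}\rho)^{3+\kappa})}
\leq c\,\|\psi\|_{L^\infty L^\infty(\tau^{1/2}\rho)}^{3+\kappa}
+\tfrac{1}{2}\|\psi\|_{C\CC^{2+\kappa}((\tau^{1/2}\rho)^{3+\kappa})}.
\end{equation*}
Absorbing the half of $\|\psi\|_{C\CC^{2+\kappa}((\tau^{1/2}\rho)^{3+\kappa})}$ on the left and estimating $\|\psi\|_{L^\infty L^\infty(\tau^{1/2}\rho)}\leq 1+\|\psi\|_{L^\infty L^\infty(\tau^{1/2}\rho)}^{3+\kappa}$ for the linear term closes the bound.

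The only delicate point is the applicability of Lemma~\ref{lemma:tau-weighted-schauder} to $\LL\psi=\Psi-\psi^3$ with the chosen parameters: one must check that $\delta=0\in[0,1+\kappa]$ is admissible and that the source terms $\Psi,\psi^3$ really lie in the space $C\CC^{\kappa}((\tau^{1/2}\rho)^{3+\kappa})$ in which we measure them — the latter uses the a priori regularity assumption $\psi\in C\CC^{2+\kappa}(\rho^{3+\kappa})\cap L^\infty L^\infty(\rho)$ together with the nesting $(\tau^{1/2}\rho)^{3+\kappa}\leq\rho^{3+\kappa}$. I do not expect any genuine obstacle here; the only subtlety is bookkeeping the $\tau$-weights so that the interpolation exponents add up correctly, which is handled cleanly by Lemma~\ref{lemma:tau-space-interpolation}.
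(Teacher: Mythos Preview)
Your proposal is correct and follows essentially the same approach as the paper: apply the $\tau$-weighted Schauder estimate Lemma~\ref{lemma:tau-weighted-schauder} (with $\alpha=\kappa$, $\delta=0$, $\beta_i=0$, and $\rho$ replaced by $\rho^{3+\kappa}$) in place of Lemma~\ref{lemma:sch}, then control $\psi^3$ via the $\tau$-interpolation Lemma~\ref{lemma:tau-space-interpolation} and Young's inequality, exactly as in the proof of Lemma~\ref{lemma:schauder-par}. The paper's proof says precisely this in one line; your write-up simply spells out the details, including the correct observation that $\tau(0)=0$ removes the initial-data term.
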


\begin{proof}
The proof follows from Lemma \ref{lemma:tau-space-interpolation} and Lemma \ref{lemma:tau-weighted-schauder} using the same approach as in the proof of Lemma \ref{lemma:schauder-par}.
\end{proof}

\subsection{Weighted  coercive estimate}
\label{ss:d2}

Next, we show that also the coercive estimates remain valid for the weight $\tau$, the proof uses the same ideas as Lemma \ref{lemma:apriori-parabolic}.

\begin{lemma}
  \label{lemma:weighted-coercive}
  Fix $\kappa > 0$ and let $\psi \in C\CC^{2 +
  \kappa} (\rho^{3 + \kappa}) \cap C^{1}L^{\infty} (\rho^{3+\kappa})\cap L^{\infty}L^{\infty}(\rho)$ be a classical solution
  to
  \[ \partial_t \psi + (\mu - \Delta) \psi + \psi^3 = \Psi, \qquad \psi (0) =
     \psi_0 . \]
  Then the following a priori estimate holds
  \[ \| \psi \|_{L^{\infty} L^{\infty} (\tau^{1 / 2} \rho)} \lesssim 1+ \|
     \Psi \|^{1 / 3}_{L^{\infty} L^{\infty} (\tau^{3 / 2} \rho^3)}  \]
     independently of the initial condition.
\end{lemma}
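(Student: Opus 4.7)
The plan is to adapt the maximum-principle argument of Lemma~\ref{lemma:apriori-parabolic} to the time-degenerate weight $\tau^{1/2}\rho$. Set $\bar\psi := \tau^{1/2}\rho\,\psi$. The point of this weight is twofold: first, $\bar\psi(0,\cdot) = 0$ since $\tau(0)=0$, so no initial-data contribution will appear; second, when we substitute $\psi = \bar\psi/(\tau^{1/2}\rho)$ into the cubic term, we obtain $\tau^{1/2}\rho\,\psi^3 = \bar\psi^3/(\tau\rho^2)$, and the price $1/(\tau\rho^2)$ on the left will be exactly cancelled by the eventual multiplication by $\tau\rho^2$ needed to isolate $\bar\psi^3$.

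Using $\partial_t\tau = 1-\tau$ and a direct computation analogous to the one in Lemma~\ref{lemma:apriori-parabolic}, the PDE becomes
\[
\partial_t\bar\psi - \Delta\bar\psi + \frac{\bar\psi^3}{\tau\rho^2}
= \tau^{1/2}\rho\,\Psi + \frac{1-\tau}{2\tau}\bar\psi + 2\frac{\nabla\rho}{\rho}\!\cdot\!\nabla\bar\psi + \Bigl(\frac{\Delta\rho}{\rho} - 2\frac{|\nabla\rho|^2}{\rho^2} - \mu\Bigr)\bar\psi.
\]
Assume first that $\bar\psi$ attains a positive global maximum $M = \bar\psi(t^\star,x^\star)$. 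If $t^\star = 0$ then $M = 0$, so we may suppose $t^\star>0$, at which point $\partial_t\bar\psi(t^\star,x^\star)=0$, $\nabla\bar\psi(t^\star,x^\star)=0$, $\Delta\bar\psi(t^\star,x^\star)\leq 0$. Multiplying the resulting pointwise inequality by $\tau(t^\star)\rho(x^\star)^2$ and using admissibility of $\rho$ (which makes $|\nabla\rho|/\rho$, $|\Delta\rho|/\rho$ bounded) yields
\[
M^3 \leq \|\tau^{3/2}\rho^3\Psi\|_{L^\infty L^\infty} + \tfrac{(1-\tau(t^\star))\rho(x^\star)^2}{2}M + C_{\rho,\mu}\,\tau(t^\star)\rho(x^\star)^2 M \lesssim \|\Psi\|_{L^\infty L^\infty(\tau^{3/2}\rho^3)} + C_{\rho,\mu}M.
\]
The crucial point is that the apparently singular factor $(1-\tau)/(2\tau)$ is multiplied through by $\tau$, producing the bounded coefficient $(1-\tau)\rho^2/2$. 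A weighted Young inequality absorbs the linear $M$ into $M^3$, giving $M \lesssim 1 + \|\Psi\|^{1/3}_{L^\infty L^\infty(\tau^{3/2}\rho^3)}$. Applying the same argument to $-\bar\psi$ yields the opposite one-sided bound.

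To remove the assumption that a global extremum is attained, we argue as at the end of Lemma~\ref{lemma:apriori-parabolic}: since $\bar\psi \in L^\infty L^\infty$ and $\rho(x) \to 0$ as $|x|\to\infty$, the modified weight $\tau^{1/2}\rho^{1+\delta}$ produces a function vanishing at spatial infinity and at $t=0$, and (by continuity in time plus boundedness) attaining its sup. Running the argument with $\rho$ replaced by $\rho^{1+\delta}$ gives constants uniform in $\delta\in(0,1)$, and the claim follows by sending $\delta \to 0$.

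The only real subtlety is the vanishing of $\tau$ at $t=0$: a naive maximum principle in $\tau^{\beta}\rho\,\psi$ for $\beta\neq 1/2$ would either leave an unbalanced singular $1/\tau$ factor on the right or fail to produce the correct power $\tau^{3/2}\rho^3$ on $\Psi$; the exponent $1/2$ (so that $\bar\psi^3$ carries weight $\tau^{3/2}\rho^3$) is exactly what is needed to make every residual term on the right-hand side bounded pointwise in $(t,x)$, and this is what makes the estimate independent of the initial condition.
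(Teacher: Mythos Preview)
Your argument is correct and follows essentially the same maximum-principle route as the paper: set $\bar\psi=\tau^{1/2}\rho\psi$, use $\bar\psi(0)=0$ to force $t^\star>0$, multiply through by $\tau\rho^2$ at the extremum to balance the cubic against $\tau^{3/2}\rho^3\Psi$, and finish with Young's inequality and the $\rho\to\rho^{1+\delta}$ trick. One small omission: since $\rho$ here is a space-time weight, your displayed identity for $\bar\psi$ is missing the term $\tfrac{\partial_t\rho}{\rho}\bar\psi$ on the right (and has harmless sign slips in the gradient and curvature terms), but as $\partial_t\rho\le 0$ this contribution has the favorable sign at a positive maximum and the argument goes through unchanged.
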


\begin{proof}
  Let $\bar{\psi} = \tau^{1 / 2} \psi \rho$ and assume for the moment that
  $\bar{\psi}$ attains its (global) maximum $M = \bar{\psi} (t^{\ast},
  x^{\ast})$ at the point $(t^{\ast}, x^{\ast})$. If $M \leq 0$, then it is
  necessary to investigate the minimum point (or alternatively the maximum of
  $- \bar{\psi}$), which we discuss below. Let us therefore assume that $M >
  0$. Then necessarily $t^{\ast} > 0$ since $\bar{\psi} (0) = 0$ and
  \[ \tau \rho^2 \partial_t  \bar{\psi} + \tau \rho^2 (- \Delta + \mu)
     \bar{\psi} + \bar{\psi}^3 = \tau^{3 / 2} \rho^3 \Psi + 
     (\tau \rho \partial_t \rho + \rho^2 \tau^{1 / 2} \partial_t \tau^{1 / 2})
     \bar{\psi} - \tau^{3 / 2} \rho^2  (\Delta \rho) \psi - 2 \tau^{3 / 2}
     \rho^2 \nabla \rho \nabla \psi . \]
  and
  \[ \partial_t  \bar{\psi} (t^{\ast}, x^{\ast}) = 0, \qquad \nabla \bar{\psi}
     (t^{\ast}, x^{\ast}) = 0, \qquad \Delta \bar{\psi} (t^{\ast}, x^{\ast})
     \leq 0 \]
  hence $\rho \nabla \psi = - \psi \nabla \rho$. Consequently $- \rho^2 \Delta
  \bar{\psi} (t^{\ast}, x^{\ast}) \geq 0$ and also $\rho \partial_t \rho
  \bar{\psi} (t^{\ast}, x^{\ast}) \leqslant 0$ since $\partial_t \rho
  \leqslant 0$. Hence
  \[ M^3 \leqslant [\tau^{3 / 2} \rho^3 \Psi ]_{|_{(t^{\ast}, x^{\ast})}} + [-\mu \tau\rho^2+\rho^2-\tau\rho\Delta\rho+2\tau |\nabla\rho|^{2}]_{|_{(t^{\ast}, x^{\ast})}} M \]
  \[ \leqslant \| \Psi \|_{L^{\infty} L^{\infty} (\tau^{3 / 2} \rho^3)} +
     c_{\rho, \mu} \| \bar{\psi} \|_{L^{\infty} L^{\infty}} . \]
  Therefore we deduce that
  \[ \bar{\psi} (t^{*},x^{*})= M \lesssim \| \Psi \|^{1 / 3}_{L^{\infty} L^{\infty}
     (\tau^{3 / 2} \rho^3)} + c_{\rho, \mu} \| \psi \|^{1 / 3}_{L^{\infty}
     L^{\infty} (\tau^{1 / 2} \rho)} . \]
  If $M < 0$ we can apply the same argument to $- \bar{\psi}$ to get
  \[ - \bar{\psi} \lesssim \| \Psi \|^{1 / 3}_{L^{\infty} L^{\infty} (\tau^{3
     / 2} \rho^3)} + c_{\rho, \mu} \| \psi \|^{1 / 3}_{L^{\infty} L^{\infty}
     (\tau^{1 / 2} \rho)} . \]
  hence
  \[ \| \psi \|_{L^{\infty} L^{\infty} (\tau^{1 / 2} \rho)} \lesssim \| \Psi
     \|^{1 / 3}_{L^{\infty} L^{\infty} (\tau^{3 / 2} \rho^3)} + c_{\rho, \mu}
     \| \psi \|^{1 / 3}_{L^{\infty} L^{\infty} (\tau^{1 / 2} \rho)} . \]
  and applying the weighted Young inequality yields the claim.

The conclusion in the case when $\tau^{1 / 2} \rho \psi$ does not
  attain its global maximum follows the same argument as in Lemma \ref{lemma:apriori-parabolic}.
\end{proof}

\begin{remark}\label{rem:down}
We note that the choice of $1/2$ as the power of the weight $\tau$ is dictated by the cubic nonlinearity through Lemma \ref{lemma:weighted-coercive}. More precisely, taking $\tau^{\alpha}$ instead of $\tau^{1/2}$ for some $\alpha>0$ and repeating the proof of the above maximum principle, it is necessary to control $\tau^{\alpha}\partial_{t}\tau^{\alpha}$ which leads to the condition $\alpha\geq 1/2$. Hence the choice $\alpha=1/2$ gives the best result.
\end{remark}

\subsection{Proof of Theorem \ref{th:come-down-3}}
\label{ss:thm91}

\begin{proof}
  Using the approach of Section \ref{sec:45} while choosing localization
  operators \eqref{eq:locd} according to the weighted norm $\| \phi + \psi \|_{L^{\infty}
  L^{\infty} (\tau^{1 / 2} \rho)}$ and the constant $L$ of various objects as in Table \ref{t:loc}, we obtain
     \begin{equation}\label{eq:Ph}
 \| \Phi \|_{C \CC^{- 2 + \alpha} (\tau^{(1 + \alpha) / 2} \rho)} \lesssim
     1.
     \end{equation}
The choice of the weight above is due to the following application of the weighted Schauder estimates from Lemma~\ref{lemma:tau-weighted-schauder}.   Namely, Lemma~\ref{lemma:tau-weighted-schauder} implies
  \begin{equation}
    \| \phi \|_{C \CC^{\alpha} (\tau^{(1 + \alpha) / 2} \rho^{})} \lesssim \|
    \phi \|_{L^{\infty} L^{\infty} (\tau^{1 / 2} \rho)} + \| \Phi \|_{C \CC^{-
    2 + \alpha} (\tau^{(1 + \alpha) / 2} \rho)} . \label{eq:down-phi}
  \end{equation}
  We note that this is not yet sufficient to bound $\phi$ in $L^{\infty}
  L^{\infty} (\tau^{1 / 2} \rho)$, since the power of $\tau$ on the left hand side of
  (\ref{eq:down-phi}) is bigger than $1 / 2$, which matters for $t$ small. In order to fill this gap, let $t\in (0,1)$ and
  $k \in \mathbbm{N}$ be such that $2^{- 2 k} \sim \lambda^{2 / \alpha} \tau (t)$
  for some small $\lambda > 0$ to be chosen below (the proportionality constants do not depend on time). Write
  \[ \| \tau^{(1 + \alpha) / 2} \phi (t) \|_{L^{\infty} (\rho_t)} \leqslant \|
     \tau^{(1 + \alpha) / 2} \Delta_{\leqslant k} \phi (t) \|_{L^{\infty}
     (\rho_t)} + \| \tau^{(1 + \alpha) / 2} \Delta_{> k} \phi (t)
     \|_{L^{\infty} (\rho_t)}, \]
  where $\rho_t (\cdot)$ denotes the weight $\rho (t, \cdot)$. For the first
  term on the right hand side we use the equation satisfied by $\phi$ together
  with the fact that the weight $\rho$ is nonincreasing in time. This gives
  \[ \| \tau^{(1 + \alpha) / 2} \phi (t) \|_{L^{\infty} (\rho_t)} \lesssim
     \tau^{(1 + \alpha) / 2} (t) \| \Delta_{\leqslant k} P_t \phi (0)
     \|_{L^{\infty} (\rho_0)} \]
  \[ + \tau^{(1 + \alpha) / 2} (t) \int_0^t  \|
     \Delta_{\leqslant k} P_{t - s} \Phi_{} (s) \|_{L^{\infty} (\rho_s)}
     \mathd s + \tau^{(1 + \alpha) / 2} (t) \| \Delta_{> k} \phi (t)
     \|_{L^{\infty} (\rho_t)} . \]
Recall  that $P_{t}=e^{t(\Delta-\mu)}$. Hence in view of \eqref{eq:sch10}, the definition of $\tau$ and the fact that $t\in(0,1)$, $\alpha - 2 < 0$ and $\alpha > 0$,  the above is further estimated
  by
  \[ \lesssim \tau^{(1 + \alpha) / 2} (t) 2^{k} \| \phi (0)
     \|_{\CC^{- 1} (\rho_0)} + 2^{- (\alpha - 2) k} \tau (
     t)\| \tau^{(1 + \alpha) / 2} \Phi_{} \|_{C \CC^{\alpha - 2}
     (\rho)} \]
  \[ + 2^{- \alpha k} \| \tau^{(1 + \alpha) / 2} \phi \|_{C \CC^{\alpha}
     (\rho)} . \]
  Using the definition of $k$ we therefore obtain
  \[ \| \tau^{(1 + \alpha) / 2} \phi (t) \|_{L^{\infty} (\rho_t)} \lesssim
     \tau (t)^{\alpha / 2} \frac{\tau (t)^{1 / 2}}{\tau (t)^{1 / 2}} \| \phi
     (0) \|_{\CC^{ - 1} (\rho_0)} + \tau (t)^{\alpha / 2} \frac{\tau
     (t)}{ \tau (t)} \| \tau^{(1 + \alpha) / 2} \Phi_{} \|_{C
     \CC^{\alpha - 2} (\rho)} \]
  \[ + \lambda \tau^{\alpha / 2} \| \tau^{(1 + \alpha) / 2} \phi \|_{C
     \CC^{\alpha} (\rho)}. \]
Hence we may divide  by $\tau^{\alpha/2}$ to obtain
  \[ \| \tau^{1 / 2} \phi (t) \|_{L^{\infty} (\rho_t)} \lesssim_{} \| \phi (0)
     \|_{\CC^{ - 1} (\rho_0)} + \| \tau^{(1 + \alpha) / 2} \Phi_{} \|_{C
     \CC^{\alpha - 2} (\rho)} + \lambda \| \tau^{(1 + \alpha) / 2} \phi \|_{C
     \CC^{\alpha} (\rho)} . \]
  Taking supremum in time and applying (\ref{eq:down-phi}) leads to
  \[ \| \phi \|_{L^{\infty} L^{\infty} (\tau^{1 / 2} \rho)} \lesssim_{} \|
     \phi (0) \|_{\CC^{ - 1} (\rho_0)} + \lambda^{} \| \phi
     \|_{L^{\infty} L^{\infty} (\tau^{1 / 2} \rho)} + \| \Phi \|_{C \CC^{- 2 +
     \alpha} (\tau^{(1 + \alpha) / 2} \rho)}. \]
Hence we can absorb the second term on the right hand side into the left hand side by choosing $\lambda$ sufficiently small.
Therefore, according to our construction of the initial datum $\phi(0)$, namely due to \eqref{eq:phi0}, and \eqref{eq:Ph}, \eqref{eq:down-phi}   we obtain
  \[ \| \phi \|_{L^{\infty} L^{\infty} (\tau^{1 / 2} \rho)} + \| \phi \|_{C
     \CC^{\alpha} (\tau^{(1 + \alpha) / 2} \rho^{})} \lesssim 1\]
     uniformly in the initial condition.
  Next, we apply Lemma \ref{lemma:tau-weighted-schauder} again, use the above
  $L^{\infty}$-bound for $\phi$ together with estimates similar to Section
  \ref{ssec:phi2} to obtain for some $\varepsilon\in (0,1)$
  \begin{equation}\label{eq:ph-down}
   \| \phi \|_{C \CC^{1 / 2 + \alpha} (\tau^{3 / 4 + \alpha / 2} \rho^{3 / 2
     + \alpha})} \lesssim 1 + \| \Phi \|_{C \CC^{- 3 / 2 + \alpha} (\tau^{3 /
     4 + \alpha / 2} \rho^{3 / 2 + \alpha})} \lesssim 1 + \| \psi
     \|_{L^{\infty} L^{\infty} (\tau^{1 / 2} \rho)}^{\varepsilon} .
     \end{equation}

As in Section \ref{sec:43}, the next step is a paracontrolled ansatz
  for $\phi .$ Here we have to be more careful due to the weight $\tau$. More
  precisely, we introduce a modified paracontrolled ansatz according to the
  formula
  \begin{equation}\label{eq:th2}
   \phi = \breve{\vartheta} - \tau^{- \frac{1 + \nu}{2}} \left(3 [\tau^{\frac{1 +
     \nu}{2}} (- X^{\!\resizebox{0.6em}{!}{
\begin{tikzpicture}
\pgfpathmoveto{\pgfqpoint{0cm}{-0.035cm}}
\pgfpathlineto{\pgfqpoint{1.376cm}{-0.035cm}}
\pgfpathlineto{\pgfqpoint{1.376cm}{1.552cm}}
\pgfpathlineto{\pgfqpoint{0cm}{1.552cm}}
\pgfpathclose
\pgfusepath{clip}
\begin{pgfscope}
\begin{pgfscope}
\pgfpathmoveto{\pgfqpoint{0cm}{-0.035cm}}
\pgfpathlineto{\pgfqpoint{1.376cm}{-0.035cm}}
\pgfpathlineto{\pgfqpoint{1.376cm}{1.552cm}}
\pgfpathlineto{\pgfqpoint{0cm}{1.552cm}}
\pgfpathclose
\pgfusepath{clip}
\begin{pgfscope}
\begin{pgfscope}
\pgfsetdash{}{0cm}
\pgfsetlinewidth{0.818mm}
\pgfsetroundcap
\pgfsetroundjoin
\pgfsetmiterlimit{7.0}
\definecolor{eps2pgf_color}{gray}{0}\pgfsetstrokecolor{eps2pgf_color}\pgfsetfillcolor{eps2pgf_color}
\pgfpathmoveto{\pgfqpoint{0.117cm}{1.421cm}}
\pgfpathlineto{\pgfqpoint{0.682cm}{0.671cm}}
\pgfpathlineto{\pgfqpoint{1.246cm}{1.421cm}}
\pgfusepath{stroke}
\end{pgfscope}
\definecolor{eps2pgf_color}{gray}{0}\pgfsetstrokecolor{eps2pgf_color}\pgfsetfillcolor{eps2pgf_color}
\pgfpathmoveto{\pgfqpoint{0.273cm}{1.395cm}}
\pgfpathcurveto{\pgfqpoint{0.273cm}{1.432cm}}{\pgfqpoint{0.259cm}{1.467cm}}{\pgfqpoint{0.233cm}{1.492cm}}
\pgfpathcurveto{\pgfqpoint{0.207cm}{1.518cm}}{\pgfqpoint{0.173cm}{1.532cm}}{\pgfqpoint{0.137cm}{1.532cm}}
\pgfpathcurveto{\pgfqpoint{0.1cm}{1.532cm}}{\pgfqpoint{0.066cm}{1.518cm}}{\pgfqpoint{0.04cm}{1.492cm}}
\pgfpathcurveto{\pgfqpoint{0.014cm}{1.467cm}}{\pgfqpoint{0cm}{1.432cm}}{\pgfqpoint{0cm}{1.395cm}}
\pgfpathcurveto{\pgfqpoint{0cm}{1.359cm}}{\pgfqpoint{0.014cm}{1.324cm}}{\pgfqpoint{0.04cm}{1.299cm}}
\pgfpathcurveto{\pgfqpoint{0.066cm}{1.273cm}}{\pgfqpoint{0.1cm}{1.258cm}}{\pgfqpoint{0.137cm}{1.258cm}}
\pgfpathcurveto{\pgfqpoint{0.173cm}{1.258cm}}{\pgfqpoint{0.207cm}{1.273cm}}{\pgfqpoint{0.233cm}{1.299cm}}
\pgfpathcurveto{\pgfqpoint{0.259cm}{1.324cm}}{\pgfqpoint{0.273cm}{1.359cm}}{\pgfqpoint{0.273cm}{1.395cm}}
\pgfusepath{fill}
\begin{pgfscope}
\pgfsetdash{}{0cm}
\pgfsetlinewidth{0.818mm}
\pgfsetmiterlimit{7.0}
\pgfpathmoveto{\pgfqpoint{0.682cm}{0.671cm}}
\pgfpathlineto{\pgfqpoint{0.679cm}{1.418cm}}
\pgfusepath{stroke}
\end{pgfscope}
\pgfpathmoveto{\pgfqpoint{0.815cm}{1.399cm}}
\pgfpathcurveto{\pgfqpoint{0.815cm}{1.435cm}}{\pgfqpoint{0.801cm}{1.47cm}}{\pgfqpoint{0.775cm}{1.496cm}}
\pgfpathcurveto{\pgfqpoint{0.75cm}{1.521cm}}{\pgfqpoint{0.715cm}{1.536cm}}{\pgfqpoint{0.679cm}{1.536cm}}
\pgfpathcurveto{\pgfqpoint{0.643cm}{1.536cm}}{\pgfqpoint{0.608cm}{1.521cm}}{\pgfqpoint{0.582cm}{1.496cm}}
\pgfpathcurveto{\pgfqpoint{0.557cm}{1.47cm}}{\pgfqpoint{0.542cm}{1.435cm}}{\pgfqpoint{0.542cm}{1.399cm}}
\pgfpathcurveto{\pgfqpoint{0.542cm}{1.363cm}}{\pgfqpoint{0.557cm}{1.328cm}}{\pgfqpoint{0.582cm}{1.302cm}}
\pgfpathcurveto{\pgfqpoint{0.608cm}{1.276cm}}{\pgfqpoint{0.643cm}{1.262cm}}{\pgfqpoint{0.679cm}{1.262cm}}
\pgfpathcurveto{\pgfqpoint{0.715cm}{1.262cm}}{\pgfqpoint{0.75cm}{1.276cm}}{\pgfqpoint{0.775cm}{1.302cm}}
\pgfpathcurveto{\pgfqpoint{0.801cm}{1.328cm}}{\pgfqpoint{0.815cm}{1.363cm}}{\pgfqpoint{0.815cm}{1.399cm}}
\pgfusepath{fill}
\pgfpathmoveto{\pgfqpoint{1.345cm}{1.371cm}}
\pgfpathcurveto{\pgfqpoint{1.345cm}{1.408cm}}{\pgfqpoint{1.331cm}{1.442cm}}{\pgfqpoint{1.305cm}{1.468cm}}
\pgfpathcurveto{\pgfqpoint{1.28cm}{1.494cm}}{\pgfqpoint{1.245cm}{1.508cm}}{\pgfqpoint{1.209cm}{1.508cm}}
\pgfpathcurveto{\pgfqpoint{1.172cm}{1.508cm}}{\pgfqpoint{1.138cm}{1.494cm}}{\pgfqpoint{1.112cm}{1.468cm}}
\pgfpathcurveto{\pgfqpoint{1.087cm}{1.442cm}}{\pgfqpoint{1.072cm}{1.408cm}}{\pgfqpoint{1.072cm}{1.371cm}}
\pgfpathcurveto{\pgfqpoint{1.072cm}{1.335cm}}{\pgfqpoint{1.087cm}{1.3cm}}{\pgfqpoint{1.112cm}{1.274cm}}
\pgfpathcurveto{\pgfqpoint{1.138cm}{1.249cm}}{\pgfqpoint{1.172cm}{1.234cm}}{\pgfqpoint{1.209cm}{1.234cm}}
\pgfpathcurveto{\pgfqpoint{1.245cm}{1.234cm}}{\pgfqpoint{1.28cm}{1.249cm}}{\pgfqpoint{1.305cm}{1.274cm}}
\pgfpathcurveto{\pgfqpoint{1.331cm}{1.3cm}}{\pgfqpoint{1.345cm}{1.335cm}}{\pgfqpoint{1.345cm}{1.371cm}}
\pgfusepath{fill}
\begin{pgfscope}
\pgfsetdash{}{0cm}
\pgfsetlinewidth{0.818mm}
\pgfsetroundcap
\pgfsetmiterlimit{4.0}
\pgfpathmoveto{\pgfqpoint{0.682cm}{0.671cm}}
\pgfpathlineto{\pgfqpoint{0.682cm}{0.042cm}}
\pgfusepath{stroke}
\end{pgfscope}
\end{pgfscope}
\end{pgfscope}
\end{pgfscope}
\end{tikzpicture}}} + \phi + \psi)] \precprec X^{\!\resizebox{0.6em}{!}{
\begin{tikzpicture}
\pgfpathmoveto{\pgfqpoint{0cm}{0cm}}
\pgfpathlineto{\pgfqpoint{1.376cm}{0cm}}
\pgfpathlineto{\pgfqpoint{1.376cm}{1.588cm}}
\pgfpathlineto{\pgfqpoint{0cm}{1.588cm}}
\pgfpathclose
\pgfusepath{clip}
\begin{pgfscope}
\begin{pgfscope}
\pgfpathmoveto{\pgfqpoint{0cm}{0cm}}
\pgfpathlineto{\pgfqpoint{1.376cm}{0cm}}
\pgfpathlineto{\pgfqpoint{1.376cm}{1.588cm}}
\pgfpathlineto{\pgfqpoint{0cm}{1.588cm}}
\pgfpathclose
\pgfusepath{clip}
\begin{pgfscope}
\begin{pgfscope}
\definecolor{eps2pgf_color}{gray}{0.976471}\pgfsetstrokecolor{eps2pgf_color}\pgfsetfillcolor{eps2pgf_color}
\pgfpathmoveto{\pgfqpoint{0cm}{0cm}}
\pgfpathlineto{\pgfqpoint{1.376cm}{0cm}}
\pgfpathlineto{\pgfqpoint{1.376cm}{1.588cm}}
\pgfpathlineto{\pgfqpoint{0cm}{1.588cm}}
\pgfpathclose
\pgfusepath{fill}
\end{pgfscope}
\begin{pgfscope}
\pgfsetdash{}{0cm}
\pgfsetlinewidth{0.818mm}
\pgfsetroundcap
\pgfsetroundjoin
\pgfsetmiterlimit{7.0}
\definecolor{eps2pgf_color}{gray}{0}\pgfsetstrokecolor{eps2pgf_color}\pgfsetfillcolor{eps2pgf_color}
\pgfpathmoveto{\pgfqpoint{0.117cm}{1.476cm}}
\pgfpathlineto{\pgfqpoint{0.682cm}{0.726cm}}
\pgfpathlineto{\pgfqpoint{1.246cm}{1.476cm}}
\pgfusepath{stroke}
\end{pgfscope}
\definecolor{eps2pgf_color}{gray}{0}\pgfsetstrokecolor{eps2pgf_color}\pgfsetfillcolor{eps2pgf_color}
\pgfpathmoveto{\pgfqpoint{0.273cm}{1.451cm}}
\pgfpathcurveto{\pgfqpoint{0.273cm}{1.487cm}}{\pgfqpoint{0.259cm}{1.522cm}}{\pgfqpoint{0.233cm}{1.547cm}}
\pgfpathcurveto{\pgfqpoint{0.207cm}{1.573cm}}{\pgfqpoint{0.173cm}{1.588cm}}{\pgfqpoint{0.137cm}{1.588cm}}
\pgfpathcurveto{\pgfqpoint{0.1cm}{1.588cm}}{\pgfqpoint{0.066cm}{1.573cm}}{\pgfqpoint{0.04cm}{1.547cm}}
\pgfpathcurveto{\pgfqpoint{0.014cm}{1.522cm}}{\pgfqpoint{0cm}{1.487cm}}{\pgfqpoint{0cm}{1.451cm}}
\pgfpathcurveto{\pgfqpoint{0cm}{1.414cm}}{\pgfqpoint{0.014cm}{1.379cm}}{\pgfqpoint{0.04cm}{1.354cm}}
\pgfpathcurveto{\pgfqpoint{0.066cm}{1.328cm}}{\pgfqpoint{0.1cm}{1.314cm}}{\pgfqpoint{0.137cm}{1.314cm}}
\pgfpathcurveto{\pgfqpoint{0.173cm}{1.314cm}}{\pgfqpoint{0.207cm}{1.328cm}}{\pgfqpoint{0.233cm}{1.354cm}}
\pgfpathcurveto{\pgfqpoint{0.259cm}{1.379cm}}{\pgfqpoint{0.273cm}{1.414cm}}{\pgfqpoint{0.273cm}{1.451cm}}
\pgfusepath{fill}
\pgfpathmoveto{\pgfqpoint{1.345cm}{1.426cm}}
\pgfpathcurveto{\pgfqpoint{1.345cm}{1.463cm}}{\pgfqpoint{1.331cm}{1.497cm}}{\pgfqpoint{1.305cm}{1.523cm}}
\pgfpathcurveto{\pgfqpoint{1.28cm}{1.549cm}}{\pgfqpoint{1.245cm}{1.563cm}}{\pgfqpoint{1.209cm}{1.563cm}}
\pgfpathcurveto{\pgfqpoint{1.172cm}{1.563cm}}{\pgfqpoint{1.138cm}{1.549cm}}{\pgfqpoint{1.112cm}{1.523cm}}
\pgfpathcurveto{\pgfqpoint{1.087cm}{1.497cm}}{\pgfqpoint{1.072cm}{1.463cm}}{\pgfqpoint{1.072cm}{1.426cm}}
\pgfpathcurveto{\pgfqpoint{1.072cm}{1.39cm}}{\pgfqpoint{1.087cm}{1.355cm}}{\pgfqpoint{1.112cm}{1.329cm}}
\pgfpathcurveto{\pgfqpoint{1.138cm}{1.304cm}}{\pgfqpoint{1.172cm}{1.289cm}}{\pgfqpoint{1.209cm}{1.289cm}}
\pgfpathcurveto{\pgfqpoint{1.245cm}{1.289cm}}{\pgfqpoint{1.28cm}{1.304cm}}{\pgfqpoint{1.305cm}{1.329cm}}
\pgfpathcurveto{\pgfqpoint{1.331cm}{1.355cm}}{\pgfqpoint{1.345cm}{1.39cm}}{\pgfqpoint{1.345cm}{1.426cm}}
\pgfusepath{fill}
\begin{pgfscope}
\pgfsetdash{}{0cm}
\pgfsetlinewidth{0.818mm}
\pgfsetroundcap
\pgfsetmiterlimit{4.0}
\pgfpathmoveto{\pgfqpoint{0.682cm}{0.726cm}}
\pgfpathlineto{\pgfqpoint{0.682cm}{0.097cm}}
\pgfusepath{stroke}
\end{pgfscope}
\end{pgfscope}
\end{pgfscope}
\end{pgfscope}
\end{tikzpicture}}}\right)
  \end{equation}
  where $\nu > 0$ will be chosen later. This leads us to
  \[ 0 = \LL \breve{\vartheta} + \LL \psi - \left( \tau^{- \frac{1 + \nu}{2}}
     \left( 3 \left[ \tau^{\frac{1 + \nu}{2}} (- X^{\!\resizebox{0.6em}{!}{
\begin{tikzpicture}
\pgfpathmoveto{\pgfqpoint{0cm}{-0.035cm}}
\pgfpathlineto{\pgfqpoint{1.376cm}{-0.035cm}}
\pgfpathlineto{\pgfqpoint{1.376cm}{1.552cm}}
\pgfpathlineto{\pgfqpoint{0cm}{1.552cm}}
\pgfpathclose
\pgfusepath{clip}
\begin{pgfscope}
\begin{pgfscope}
\pgfpathmoveto{\pgfqpoint{0cm}{-0.035cm}}
\pgfpathlineto{\pgfqpoint{1.376cm}{-0.035cm}}
\pgfpathlineto{\pgfqpoint{1.376cm}{1.552cm}}
\pgfpathlineto{\pgfqpoint{0cm}{1.552cm}}
\pgfpathclose
\pgfusepath{clip}
\begin{pgfscope}
\begin{pgfscope}
\pgfsetdash{}{0cm}
\pgfsetlinewidth{0.818mm}
\pgfsetroundcap
\pgfsetroundjoin
\pgfsetmiterlimit{7.0}
\definecolor{eps2pgf_color}{gray}{0}\pgfsetstrokecolor{eps2pgf_color}\pgfsetfillcolor{eps2pgf_color}
\pgfpathmoveto{\pgfqpoint{0.117cm}{1.421cm}}
\pgfpathlineto{\pgfqpoint{0.682cm}{0.671cm}}
\pgfpathlineto{\pgfqpoint{1.246cm}{1.421cm}}
\pgfusepath{stroke}
\end{pgfscope}
\definecolor{eps2pgf_color}{gray}{0}\pgfsetstrokecolor{eps2pgf_color}\pgfsetfillcolor{eps2pgf_color}
\pgfpathmoveto{\pgfqpoint{0.273cm}{1.395cm}}
\pgfpathcurveto{\pgfqpoint{0.273cm}{1.432cm}}{\pgfqpoint{0.259cm}{1.467cm}}{\pgfqpoint{0.233cm}{1.492cm}}
\pgfpathcurveto{\pgfqpoint{0.207cm}{1.518cm}}{\pgfqpoint{0.173cm}{1.532cm}}{\pgfqpoint{0.137cm}{1.532cm}}
\pgfpathcurveto{\pgfqpoint{0.1cm}{1.532cm}}{\pgfqpoint{0.066cm}{1.518cm}}{\pgfqpoint{0.04cm}{1.492cm}}
\pgfpathcurveto{\pgfqpoint{0.014cm}{1.467cm}}{\pgfqpoint{0cm}{1.432cm}}{\pgfqpoint{0cm}{1.395cm}}
\pgfpathcurveto{\pgfqpoint{0cm}{1.359cm}}{\pgfqpoint{0.014cm}{1.324cm}}{\pgfqpoint{0.04cm}{1.299cm}}
\pgfpathcurveto{\pgfqpoint{0.066cm}{1.273cm}}{\pgfqpoint{0.1cm}{1.258cm}}{\pgfqpoint{0.137cm}{1.258cm}}
\pgfpathcurveto{\pgfqpoint{0.173cm}{1.258cm}}{\pgfqpoint{0.207cm}{1.273cm}}{\pgfqpoint{0.233cm}{1.299cm}}
\pgfpathcurveto{\pgfqpoint{0.259cm}{1.324cm}}{\pgfqpoint{0.273cm}{1.359cm}}{\pgfqpoint{0.273cm}{1.395cm}}
\pgfusepath{fill}
\begin{pgfscope}
\pgfsetdash{}{0cm}
\pgfsetlinewidth{0.818mm}
\pgfsetmiterlimit{7.0}
\pgfpathmoveto{\pgfqpoint{0.682cm}{0.671cm}}
\pgfpathlineto{\pgfqpoint{0.679cm}{1.418cm}}
\pgfusepath{stroke}
\end{pgfscope}
\pgfpathmoveto{\pgfqpoint{0.815cm}{1.399cm}}
\pgfpathcurveto{\pgfqpoint{0.815cm}{1.435cm}}{\pgfqpoint{0.801cm}{1.47cm}}{\pgfqpoint{0.775cm}{1.496cm}}
\pgfpathcurveto{\pgfqpoint{0.75cm}{1.521cm}}{\pgfqpoint{0.715cm}{1.536cm}}{\pgfqpoint{0.679cm}{1.536cm}}
\pgfpathcurveto{\pgfqpoint{0.643cm}{1.536cm}}{\pgfqpoint{0.608cm}{1.521cm}}{\pgfqpoint{0.582cm}{1.496cm}}
\pgfpathcurveto{\pgfqpoint{0.557cm}{1.47cm}}{\pgfqpoint{0.542cm}{1.435cm}}{\pgfqpoint{0.542cm}{1.399cm}}
\pgfpathcurveto{\pgfqpoint{0.542cm}{1.363cm}}{\pgfqpoint{0.557cm}{1.328cm}}{\pgfqpoint{0.582cm}{1.302cm}}
\pgfpathcurveto{\pgfqpoint{0.608cm}{1.276cm}}{\pgfqpoint{0.643cm}{1.262cm}}{\pgfqpoint{0.679cm}{1.262cm}}
\pgfpathcurveto{\pgfqpoint{0.715cm}{1.262cm}}{\pgfqpoint{0.75cm}{1.276cm}}{\pgfqpoint{0.775cm}{1.302cm}}
\pgfpathcurveto{\pgfqpoint{0.801cm}{1.328cm}}{\pgfqpoint{0.815cm}{1.363cm}}{\pgfqpoint{0.815cm}{1.399cm}}
\pgfusepath{fill}
\pgfpathmoveto{\pgfqpoint{1.345cm}{1.371cm}}
\pgfpathcurveto{\pgfqpoint{1.345cm}{1.408cm}}{\pgfqpoint{1.331cm}{1.442cm}}{\pgfqpoint{1.305cm}{1.468cm}}
\pgfpathcurveto{\pgfqpoint{1.28cm}{1.494cm}}{\pgfqpoint{1.245cm}{1.508cm}}{\pgfqpoint{1.209cm}{1.508cm}}
\pgfpathcurveto{\pgfqpoint{1.172cm}{1.508cm}}{\pgfqpoint{1.138cm}{1.494cm}}{\pgfqpoint{1.112cm}{1.468cm}}
\pgfpathcurveto{\pgfqpoint{1.087cm}{1.442cm}}{\pgfqpoint{1.072cm}{1.408cm}}{\pgfqpoint{1.072cm}{1.371cm}}
\pgfpathcurveto{\pgfqpoint{1.072cm}{1.335cm}}{\pgfqpoint{1.087cm}{1.3cm}}{\pgfqpoint{1.112cm}{1.274cm}}
\pgfpathcurveto{\pgfqpoint{1.138cm}{1.249cm}}{\pgfqpoint{1.172cm}{1.234cm}}{\pgfqpoint{1.209cm}{1.234cm}}
\pgfpathcurveto{\pgfqpoint{1.245cm}{1.234cm}}{\pgfqpoint{1.28cm}{1.249cm}}{\pgfqpoint{1.305cm}{1.274cm}}
\pgfpathcurveto{\pgfqpoint{1.331cm}{1.3cm}}{\pgfqpoint{1.345cm}{1.335cm}}{\pgfqpoint{1.345cm}{1.371cm}}
\pgfusepath{fill}
\begin{pgfscope}
\pgfsetdash{}{0cm}
\pgfsetlinewidth{0.818mm}
\pgfsetroundcap
\pgfsetmiterlimit{4.0}
\pgfpathmoveto{\pgfqpoint{0.682cm}{0.671cm}}
\pgfpathlineto{\pgfqpoint{0.682cm}{0.042cm}}
\pgfusepath{stroke}
\end{pgfscope}
\end{pgfscope}
\end{pgfscope}
\end{pgfscope}
\end{tikzpicture}}} + \phi + \psi)
     \right] \precprec \llbracket X^2\rrbracket \right) - 3 (- X^{\!\resizebox{0.6em}{!}{
\begin{tikzpicture}
\pgfpathmoveto{\pgfqpoint{0cm}{-0.035cm}}
\pgfpathlineto{\pgfqpoint{1.376cm}{-0.035cm}}
\pgfpathlineto{\pgfqpoint{1.376cm}{1.552cm}}
\pgfpathlineto{\pgfqpoint{0cm}{1.552cm}}
\pgfpathclose
\pgfusepath{clip}
\begin{pgfscope}
\begin{pgfscope}
\pgfpathmoveto{\pgfqpoint{0cm}{-0.035cm}}
\pgfpathlineto{\pgfqpoint{1.376cm}{-0.035cm}}
\pgfpathlineto{\pgfqpoint{1.376cm}{1.552cm}}
\pgfpathlineto{\pgfqpoint{0cm}{1.552cm}}
\pgfpathclose
\pgfusepath{clip}
\begin{pgfscope}
\begin{pgfscope}
\pgfsetdash{}{0cm}
\pgfsetlinewidth{0.818mm}
\pgfsetroundcap
\pgfsetroundjoin
\pgfsetmiterlimit{7.0}
\definecolor{eps2pgf_color}{gray}{0}\pgfsetstrokecolor{eps2pgf_color}\pgfsetfillcolor{eps2pgf_color}
\pgfpathmoveto{\pgfqpoint{0.117cm}{1.421cm}}
\pgfpathlineto{\pgfqpoint{0.682cm}{0.671cm}}
\pgfpathlineto{\pgfqpoint{1.246cm}{1.421cm}}
\pgfusepath{stroke}
\end{pgfscope}
\definecolor{eps2pgf_color}{gray}{0}\pgfsetstrokecolor{eps2pgf_color}\pgfsetfillcolor{eps2pgf_color}
\pgfpathmoveto{\pgfqpoint{0.273cm}{1.395cm}}
\pgfpathcurveto{\pgfqpoint{0.273cm}{1.432cm}}{\pgfqpoint{0.259cm}{1.467cm}}{\pgfqpoint{0.233cm}{1.492cm}}
\pgfpathcurveto{\pgfqpoint{0.207cm}{1.518cm}}{\pgfqpoint{0.173cm}{1.532cm}}{\pgfqpoint{0.137cm}{1.532cm}}
\pgfpathcurveto{\pgfqpoint{0.1cm}{1.532cm}}{\pgfqpoint{0.066cm}{1.518cm}}{\pgfqpoint{0.04cm}{1.492cm}}
\pgfpathcurveto{\pgfqpoint{0.014cm}{1.467cm}}{\pgfqpoint{0cm}{1.432cm}}{\pgfqpoint{0cm}{1.395cm}}
\pgfpathcurveto{\pgfqpoint{0cm}{1.359cm}}{\pgfqpoint{0.014cm}{1.324cm}}{\pgfqpoint{0.04cm}{1.299cm}}
\pgfpathcurveto{\pgfqpoint{0.066cm}{1.273cm}}{\pgfqpoint{0.1cm}{1.258cm}}{\pgfqpoint{0.137cm}{1.258cm}}
\pgfpathcurveto{\pgfqpoint{0.173cm}{1.258cm}}{\pgfqpoint{0.207cm}{1.273cm}}{\pgfqpoint{0.233cm}{1.299cm}}
\pgfpathcurveto{\pgfqpoint{0.259cm}{1.324cm}}{\pgfqpoint{0.273cm}{1.359cm}}{\pgfqpoint{0.273cm}{1.395cm}}
\pgfusepath{fill}
\begin{pgfscope}
\pgfsetdash{}{0cm}
\pgfsetlinewidth{0.818mm}
\pgfsetmiterlimit{7.0}
\pgfpathmoveto{\pgfqpoint{0.682cm}{0.671cm}}
\pgfpathlineto{\pgfqpoint{0.679cm}{1.418cm}}
\pgfusepath{stroke}
\end{pgfscope}
\pgfpathmoveto{\pgfqpoint{0.815cm}{1.399cm}}
\pgfpathcurveto{\pgfqpoint{0.815cm}{1.435cm}}{\pgfqpoint{0.801cm}{1.47cm}}{\pgfqpoint{0.775cm}{1.496cm}}
\pgfpathcurveto{\pgfqpoint{0.75cm}{1.521cm}}{\pgfqpoint{0.715cm}{1.536cm}}{\pgfqpoint{0.679cm}{1.536cm}}
\pgfpathcurveto{\pgfqpoint{0.643cm}{1.536cm}}{\pgfqpoint{0.608cm}{1.521cm}}{\pgfqpoint{0.582cm}{1.496cm}}
\pgfpathcurveto{\pgfqpoint{0.557cm}{1.47cm}}{\pgfqpoint{0.542cm}{1.435cm}}{\pgfqpoint{0.542cm}{1.399cm}}
\pgfpathcurveto{\pgfqpoint{0.542cm}{1.363cm}}{\pgfqpoint{0.557cm}{1.328cm}}{\pgfqpoint{0.582cm}{1.302cm}}
\pgfpathcurveto{\pgfqpoint{0.608cm}{1.276cm}}{\pgfqpoint{0.643cm}{1.262cm}}{\pgfqpoint{0.679cm}{1.262cm}}
\pgfpathcurveto{\pgfqpoint{0.715cm}{1.262cm}}{\pgfqpoint{0.75cm}{1.276cm}}{\pgfqpoint{0.775cm}{1.302cm}}
\pgfpathcurveto{\pgfqpoint{0.801cm}{1.328cm}}{\pgfqpoint{0.815cm}{1.363cm}}{\pgfqpoint{0.815cm}{1.399cm}}
\pgfusepath{fill}
\pgfpathmoveto{\pgfqpoint{1.345cm}{1.371cm}}
\pgfpathcurveto{\pgfqpoint{1.345cm}{1.408cm}}{\pgfqpoint{1.331cm}{1.442cm}}{\pgfqpoint{1.305cm}{1.468cm}}
\pgfpathcurveto{\pgfqpoint{1.28cm}{1.494cm}}{\pgfqpoint{1.245cm}{1.508cm}}{\pgfqpoint{1.209cm}{1.508cm}}
\pgfpathcurveto{\pgfqpoint{1.172cm}{1.508cm}}{\pgfqpoint{1.138cm}{1.494cm}}{\pgfqpoint{1.112cm}{1.468cm}}
\pgfpathcurveto{\pgfqpoint{1.087cm}{1.442cm}}{\pgfqpoint{1.072cm}{1.408cm}}{\pgfqpoint{1.072cm}{1.371cm}}
\pgfpathcurveto{\pgfqpoint{1.072cm}{1.335cm}}{\pgfqpoint{1.087cm}{1.3cm}}{\pgfqpoint{1.112cm}{1.274cm}}
\pgfpathcurveto{\pgfqpoint{1.138cm}{1.249cm}}{\pgfqpoint{1.172cm}{1.234cm}}{\pgfqpoint{1.209cm}{1.234cm}}
\pgfpathcurveto{\pgfqpoint{1.245cm}{1.234cm}}{\pgfqpoint{1.28cm}{1.249cm}}{\pgfqpoint{1.305cm}{1.274cm}}
\pgfpathcurveto{\pgfqpoint{1.331cm}{1.3cm}}{\pgfqpoint{1.345cm}{1.335cm}}{\pgfqpoint{1.345cm}{1.371cm}}
\pgfusepath{fill}
\begin{pgfscope}
\pgfsetdash{}{0cm}
\pgfsetlinewidth{0.818mm}
\pgfsetroundcap
\pgfsetmiterlimit{4.0}
\pgfpathmoveto{\pgfqpoint{0.682cm}{0.671cm}}
\pgfpathlineto{\pgfqpoint{0.682cm}{0.042cm}}
\pgfusepath{stroke}
\end{pgfscope}
\end{pgfscope}
\end{pgfscope}
\end{pgfscope}
\end{tikzpicture}}} + \phi + \psi)
     \prec \llbracket X^2\rrbracket \right) \]
  \[ + 3 \llbracket X^2\rrbracket \preccurlyeq (- X^{\!\resizebox{0.6em}{!}{
\begin{tikzpicture}
\pgfpathmoveto{\pgfqpoint{0cm}{-0.035cm}}
\pgfpathlineto{\pgfqpoint{1.376cm}{-0.035cm}}
\pgfpathlineto{\pgfqpoint{1.376cm}{1.552cm}}
\pgfpathlineto{\pgfqpoint{0cm}{1.552cm}}
\pgfpathclose
\pgfusepath{clip}
\begin{pgfscope}
\begin{pgfscope}
\pgfpathmoveto{\pgfqpoint{0cm}{-0.035cm}}
\pgfpathlineto{\pgfqpoint{1.376cm}{-0.035cm}}
\pgfpathlineto{\pgfqpoint{1.376cm}{1.552cm}}
\pgfpathlineto{\pgfqpoint{0cm}{1.552cm}}
\pgfpathclose
\pgfusepath{clip}
\begin{pgfscope}
\begin{pgfscope}
\pgfsetdash{}{0cm}
\pgfsetlinewidth{0.818mm}
\pgfsetroundcap
\pgfsetroundjoin
\pgfsetmiterlimit{7.0}
\definecolor{eps2pgf_color}{gray}{0}\pgfsetstrokecolor{eps2pgf_color}\pgfsetfillcolor{eps2pgf_color}
\pgfpathmoveto{\pgfqpoint{0.117cm}{1.421cm}}
\pgfpathlineto{\pgfqpoint{0.682cm}{0.671cm}}
\pgfpathlineto{\pgfqpoint{1.246cm}{1.421cm}}
\pgfusepath{stroke}
\end{pgfscope}
\definecolor{eps2pgf_color}{gray}{0}\pgfsetstrokecolor{eps2pgf_color}\pgfsetfillcolor{eps2pgf_color}
\pgfpathmoveto{\pgfqpoint{0.273cm}{1.395cm}}
\pgfpathcurveto{\pgfqpoint{0.273cm}{1.432cm}}{\pgfqpoint{0.259cm}{1.467cm}}{\pgfqpoint{0.233cm}{1.492cm}}
\pgfpathcurveto{\pgfqpoint{0.207cm}{1.518cm}}{\pgfqpoint{0.173cm}{1.532cm}}{\pgfqpoint{0.137cm}{1.532cm}}
\pgfpathcurveto{\pgfqpoint{0.1cm}{1.532cm}}{\pgfqpoint{0.066cm}{1.518cm}}{\pgfqpoint{0.04cm}{1.492cm}}
\pgfpathcurveto{\pgfqpoint{0.014cm}{1.467cm}}{\pgfqpoint{0cm}{1.432cm}}{\pgfqpoint{0cm}{1.395cm}}
\pgfpathcurveto{\pgfqpoint{0cm}{1.359cm}}{\pgfqpoint{0.014cm}{1.324cm}}{\pgfqpoint{0.04cm}{1.299cm}}
\pgfpathcurveto{\pgfqpoint{0.066cm}{1.273cm}}{\pgfqpoint{0.1cm}{1.258cm}}{\pgfqpoint{0.137cm}{1.258cm}}
\pgfpathcurveto{\pgfqpoint{0.173cm}{1.258cm}}{\pgfqpoint{0.207cm}{1.273cm}}{\pgfqpoint{0.233cm}{1.299cm}}
\pgfpathcurveto{\pgfqpoint{0.259cm}{1.324cm}}{\pgfqpoint{0.273cm}{1.359cm}}{\pgfqpoint{0.273cm}{1.395cm}}
\pgfusepath{fill}
\begin{pgfscope}
\pgfsetdash{}{0cm}
\pgfsetlinewidth{0.818mm}
\pgfsetmiterlimit{7.0}
\pgfpathmoveto{\pgfqpoint{0.682cm}{0.671cm}}
\pgfpathlineto{\pgfqpoint{0.679cm}{1.418cm}}
\pgfusepath{stroke}
\end{pgfscope}
\pgfpathmoveto{\pgfqpoint{0.815cm}{1.399cm}}
\pgfpathcurveto{\pgfqpoint{0.815cm}{1.435cm}}{\pgfqpoint{0.801cm}{1.47cm}}{\pgfqpoint{0.775cm}{1.496cm}}
\pgfpathcurveto{\pgfqpoint{0.75cm}{1.521cm}}{\pgfqpoint{0.715cm}{1.536cm}}{\pgfqpoint{0.679cm}{1.536cm}}
\pgfpathcurveto{\pgfqpoint{0.643cm}{1.536cm}}{\pgfqpoint{0.608cm}{1.521cm}}{\pgfqpoint{0.582cm}{1.496cm}}
\pgfpathcurveto{\pgfqpoint{0.557cm}{1.47cm}}{\pgfqpoint{0.542cm}{1.435cm}}{\pgfqpoint{0.542cm}{1.399cm}}
\pgfpathcurveto{\pgfqpoint{0.542cm}{1.363cm}}{\pgfqpoint{0.557cm}{1.328cm}}{\pgfqpoint{0.582cm}{1.302cm}}
\pgfpathcurveto{\pgfqpoint{0.608cm}{1.276cm}}{\pgfqpoint{0.643cm}{1.262cm}}{\pgfqpoint{0.679cm}{1.262cm}}
\pgfpathcurveto{\pgfqpoint{0.715cm}{1.262cm}}{\pgfqpoint{0.75cm}{1.276cm}}{\pgfqpoint{0.775cm}{1.302cm}}
\pgfpathcurveto{\pgfqpoint{0.801cm}{1.328cm}}{\pgfqpoint{0.815cm}{1.363cm}}{\pgfqpoint{0.815cm}{1.399cm}}
\pgfusepath{fill}
\pgfpathmoveto{\pgfqpoint{1.345cm}{1.371cm}}
\pgfpathcurveto{\pgfqpoint{1.345cm}{1.408cm}}{\pgfqpoint{1.331cm}{1.442cm}}{\pgfqpoint{1.305cm}{1.468cm}}
\pgfpathcurveto{\pgfqpoint{1.28cm}{1.494cm}}{\pgfqpoint{1.245cm}{1.508cm}}{\pgfqpoint{1.209cm}{1.508cm}}
\pgfpathcurveto{\pgfqpoint{1.172cm}{1.508cm}}{\pgfqpoint{1.138cm}{1.494cm}}{\pgfqpoint{1.112cm}{1.468cm}}
\pgfpathcurveto{\pgfqpoint{1.087cm}{1.442cm}}{\pgfqpoint{1.072cm}{1.408cm}}{\pgfqpoint{1.072cm}{1.371cm}}
\pgfpathcurveto{\pgfqpoint{1.072cm}{1.335cm}}{\pgfqpoint{1.087cm}{1.3cm}}{\pgfqpoint{1.112cm}{1.274cm}}
\pgfpathcurveto{\pgfqpoint{1.138cm}{1.249cm}}{\pgfqpoint{1.172cm}{1.234cm}}{\pgfqpoint{1.209cm}{1.234cm}}
\pgfpathcurveto{\pgfqpoint{1.245cm}{1.234cm}}{\pgfqpoint{1.28cm}{1.249cm}}{\pgfqpoint{1.305cm}{1.274cm}}
\pgfpathcurveto{\pgfqpoint{1.331cm}{1.3cm}}{\pgfqpoint{1.345cm}{1.335cm}}{\pgfqpoint{1.345cm}{1.371cm}}
\pgfusepath{fill}
\begin{pgfscope}
\pgfsetdash{}{0cm}
\pgfsetlinewidth{0.818mm}
\pgfsetroundcap
\pgfsetmiterlimit{4.0}
\pgfpathmoveto{\pgfqpoint{0.682cm}{0.671cm}}
\pgfpathlineto{\pgfqpoint{0.682cm}{0.042cm}}
\pgfusepath{stroke}
\end{pgfscope}
\end{pgfscope}
\end{pgfscope}
\end{pgfscope}
\end{tikzpicture}}} + \phi + \psi) \]
  \[ - \left( \LL \left[ \tau^{- \frac{1 + \nu}{2}} \left( \left[ 3
     \tau^{\frac{1 + \nu}{2}} (- X^{\!\resizebox{0.6em}{!}{
\begin{tikzpicture}
\pgfpathmoveto{\pgfqpoint{0cm}{-0.035cm}}
\pgfpathlineto{\pgfqpoint{1.376cm}{-0.035cm}}
\pgfpathlineto{\pgfqpoint{1.376cm}{1.552cm}}
\pgfpathlineto{\pgfqpoint{0cm}{1.552cm}}
\pgfpathclose
\pgfusepath{clip}
\begin{pgfscope}
\begin{pgfscope}
\pgfpathmoveto{\pgfqpoint{0cm}{-0.035cm}}
\pgfpathlineto{\pgfqpoint{1.376cm}{-0.035cm}}
\pgfpathlineto{\pgfqpoint{1.376cm}{1.552cm}}
\pgfpathlineto{\pgfqpoint{0cm}{1.552cm}}
\pgfpathclose
\pgfusepath{clip}
\begin{pgfscope}
\begin{pgfscope}
\pgfsetdash{}{0cm}
\pgfsetlinewidth{0.818mm}
\pgfsetroundcap
\pgfsetroundjoin
\pgfsetmiterlimit{7.0}
\definecolor{eps2pgf_color}{gray}{0}\pgfsetstrokecolor{eps2pgf_color}\pgfsetfillcolor{eps2pgf_color}
\pgfpathmoveto{\pgfqpoint{0.117cm}{1.421cm}}
\pgfpathlineto{\pgfqpoint{0.682cm}{0.671cm}}
\pgfpathlineto{\pgfqpoint{1.246cm}{1.421cm}}
\pgfusepath{stroke}
\end{pgfscope}
\definecolor{eps2pgf_color}{gray}{0}\pgfsetstrokecolor{eps2pgf_color}\pgfsetfillcolor{eps2pgf_color}
\pgfpathmoveto{\pgfqpoint{0.273cm}{1.395cm}}
\pgfpathcurveto{\pgfqpoint{0.273cm}{1.432cm}}{\pgfqpoint{0.259cm}{1.467cm}}{\pgfqpoint{0.233cm}{1.492cm}}
\pgfpathcurveto{\pgfqpoint{0.207cm}{1.518cm}}{\pgfqpoint{0.173cm}{1.532cm}}{\pgfqpoint{0.137cm}{1.532cm}}
\pgfpathcurveto{\pgfqpoint{0.1cm}{1.532cm}}{\pgfqpoint{0.066cm}{1.518cm}}{\pgfqpoint{0.04cm}{1.492cm}}
\pgfpathcurveto{\pgfqpoint{0.014cm}{1.467cm}}{\pgfqpoint{0cm}{1.432cm}}{\pgfqpoint{0cm}{1.395cm}}
\pgfpathcurveto{\pgfqpoint{0cm}{1.359cm}}{\pgfqpoint{0.014cm}{1.324cm}}{\pgfqpoint{0.04cm}{1.299cm}}
\pgfpathcurveto{\pgfqpoint{0.066cm}{1.273cm}}{\pgfqpoint{0.1cm}{1.258cm}}{\pgfqpoint{0.137cm}{1.258cm}}
\pgfpathcurveto{\pgfqpoint{0.173cm}{1.258cm}}{\pgfqpoint{0.207cm}{1.273cm}}{\pgfqpoint{0.233cm}{1.299cm}}
\pgfpathcurveto{\pgfqpoint{0.259cm}{1.324cm}}{\pgfqpoint{0.273cm}{1.359cm}}{\pgfqpoint{0.273cm}{1.395cm}}
\pgfusepath{fill}
\begin{pgfscope}
\pgfsetdash{}{0cm}
\pgfsetlinewidth{0.818mm}
\pgfsetmiterlimit{7.0}
\pgfpathmoveto{\pgfqpoint{0.682cm}{0.671cm}}
\pgfpathlineto{\pgfqpoint{0.679cm}{1.418cm}}
\pgfusepath{stroke}
\end{pgfscope}
\pgfpathmoveto{\pgfqpoint{0.815cm}{1.399cm}}
\pgfpathcurveto{\pgfqpoint{0.815cm}{1.435cm}}{\pgfqpoint{0.801cm}{1.47cm}}{\pgfqpoint{0.775cm}{1.496cm}}
\pgfpathcurveto{\pgfqpoint{0.75cm}{1.521cm}}{\pgfqpoint{0.715cm}{1.536cm}}{\pgfqpoint{0.679cm}{1.536cm}}
\pgfpathcurveto{\pgfqpoint{0.643cm}{1.536cm}}{\pgfqpoint{0.608cm}{1.521cm}}{\pgfqpoint{0.582cm}{1.496cm}}
\pgfpathcurveto{\pgfqpoint{0.557cm}{1.47cm}}{\pgfqpoint{0.542cm}{1.435cm}}{\pgfqpoint{0.542cm}{1.399cm}}
\pgfpathcurveto{\pgfqpoint{0.542cm}{1.363cm}}{\pgfqpoint{0.557cm}{1.328cm}}{\pgfqpoint{0.582cm}{1.302cm}}
\pgfpathcurveto{\pgfqpoint{0.608cm}{1.276cm}}{\pgfqpoint{0.643cm}{1.262cm}}{\pgfqpoint{0.679cm}{1.262cm}}
\pgfpathcurveto{\pgfqpoint{0.715cm}{1.262cm}}{\pgfqpoint{0.75cm}{1.276cm}}{\pgfqpoint{0.775cm}{1.302cm}}
\pgfpathcurveto{\pgfqpoint{0.801cm}{1.328cm}}{\pgfqpoint{0.815cm}{1.363cm}}{\pgfqpoint{0.815cm}{1.399cm}}
\pgfusepath{fill}
\pgfpathmoveto{\pgfqpoint{1.345cm}{1.371cm}}
\pgfpathcurveto{\pgfqpoint{1.345cm}{1.408cm}}{\pgfqpoint{1.331cm}{1.442cm}}{\pgfqpoint{1.305cm}{1.468cm}}
\pgfpathcurveto{\pgfqpoint{1.28cm}{1.494cm}}{\pgfqpoint{1.245cm}{1.508cm}}{\pgfqpoint{1.209cm}{1.508cm}}
\pgfpathcurveto{\pgfqpoint{1.172cm}{1.508cm}}{\pgfqpoint{1.138cm}{1.494cm}}{\pgfqpoint{1.112cm}{1.468cm}}
\pgfpathcurveto{\pgfqpoint{1.087cm}{1.442cm}}{\pgfqpoint{1.072cm}{1.408cm}}{\pgfqpoint{1.072cm}{1.371cm}}
\pgfpathcurveto{\pgfqpoint{1.072cm}{1.335cm}}{\pgfqpoint{1.087cm}{1.3cm}}{\pgfqpoint{1.112cm}{1.274cm}}
\pgfpathcurveto{\pgfqpoint{1.138cm}{1.249cm}}{\pgfqpoint{1.172cm}{1.234cm}}{\pgfqpoint{1.209cm}{1.234cm}}
\pgfpathcurveto{\pgfqpoint{1.245cm}{1.234cm}}{\pgfqpoint{1.28cm}{1.249cm}}{\pgfqpoint{1.305cm}{1.274cm}}
\pgfpathcurveto{\pgfqpoint{1.331cm}{1.3cm}}{\pgfqpoint{1.345cm}{1.335cm}}{\pgfqpoint{1.345cm}{1.371cm}}
\pgfusepath{fill}
\begin{pgfscope}
\pgfsetdash{}{0cm}
\pgfsetlinewidth{0.818mm}
\pgfsetroundcap
\pgfsetmiterlimit{4.0}
\pgfpathmoveto{\pgfqpoint{0.682cm}{0.671cm}}
\pgfpathlineto{\pgfqpoint{0.682cm}{0.042cm}}
\pgfusepath{stroke}
\end{pgfscope}
\end{pgfscope}
\end{pgfscope}
\end{pgfscope}
\end{tikzpicture}}} + \phi + \psi) \right]
     \precprec X^{\!\resizebox{0.6em}{!}{
\begin{tikzpicture}
\pgfpathmoveto{\pgfqpoint{0cm}{0cm}}
\pgfpathlineto{\pgfqpoint{1.376cm}{0cm}}
\pgfpathlineto{\pgfqpoint{1.376cm}{1.588cm}}
\pgfpathlineto{\pgfqpoint{0cm}{1.588cm}}
\pgfpathclose
\pgfusepath{clip}
\begin{pgfscope}
\begin{pgfscope}
\pgfpathmoveto{\pgfqpoint{0cm}{0cm}}
\pgfpathlineto{\pgfqpoint{1.376cm}{0cm}}
\pgfpathlineto{\pgfqpoint{1.376cm}{1.588cm}}
\pgfpathlineto{\pgfqpoint{0cm}{1.588cm}}
\pgfpathclose
\pgfusepath{clip}
\begin{pgfscope}
\begin{pgfscope}
\definecolor{eps2pgf_color}{gray}{0.976471}\pgfsetstrokecolor{eps2pgf_color}\pgfsetfillcolor{eps2pgf_color}
\pgfpathmoveto{\pgfqpoint{0cm}{0cm}}
\pgfpathlineto{\pgfqpoint{1.376cm}{0cm}}
\pgfpathlineto{\pgfqpoint{1.376cm}{1.588cm}}
\pgfpathlineto{\pgfqpoint{0cm}{1.588cm}}
\pgfpathclose
\pgfusepath{fill}
\end{pgfscope}
\begin{pgfscope}
\pgfsetdash{}{0cm}
\pgfsetlinewidth{0.818mm}
\pgfsetroundcap
\pgfsetroundjoin
\pgfsetmiterlimit{7.0}
\definecolor{eps2pgf_color}{gray}{0}\pgfsetstrokecolor{eps2pgf_color}\pgfsetfillcolor{eps2pgf_color}
\pgfpathmoveto{\pgfqpoint{0.117cm}{1.476cm}}
\pgfpathlineto{\pgfqpoint{0.682cm}{0.726cm}}
\pgfpathlineto{\pgfqpoint{1.246cm}{1.476cm}}
\pgfusepath{stroke}
\end{pgfscope}
\definecolor{eps2pgf_color}{gray}{0}\pgfsetstrokecolor{eps2pgf_color}\pgfsetfillcolor{eps2pgf_color}
\pgfpathmoveto{\pgfqpoint{0.273cm}{1.451cm}}
\pgfpathcurveto{\pgfqpoint{0.273cm}{1.487cm}}{\pgfqpoint{0.259cm}{1.522cm}}{\pgfqpoint{0.233cm}{1.547cm}}
\pgfpathcurveto{\pgfqpoint{0.207cm}{1.573cm}}{\pgfqpoint{0.173cm}{1.588cm}}{\pgfqpoint{0.137cm}{1.588cm}}
\pgfpathcurveto{\pgfqpoint{0.1cm}{1.588cm}}{\pgfqpoint{0.066cm}{1.573cm}}{\pgfqpoint{0.04cm}{1.547cm}}
\pgfpathcurveto{\pgfqpoint{0.014cm}{1.522cm}}{\pgfqpoint{0cm}{1.487cm}}{\pgfqpoint{0cm}{1.451cm}}
\pgfpathcurveto{\pgfqpoint{0cm}{1.414cm}}{\pgfqpoint{0.014cm}{1.379cm}}{\pgfqpoint{0.04cm}{1.354cm}}
\pgfpathcurveto{\pgfqpoint{0.066cm}{1.328cm}}{\pgfqpoint{0.1cm}{1.314cm}}{\pgfqpoint{0.137cm}{1.314cm}}
\pgfpathcurveto{\pgfqpoint{0.173cm}{1.314cm}}{\pgfqpoint{0.207cm}{1.328cm}}{\pgfqpoint{0.233cm}{1.354cm}}
\pgfpathcurveto{\pgfqpoint{0.259cm}{1.379cm}}{\pgfqpoint{0.273cm}{1.414cm}}{\pgfqpoint{0.273cm}{1.451cm}}
\pgfusepath{fill}
\pgfpathmoveto{\pgfqpoint{1.345cm}{1.426cm}}
\pgfpathcurveto{\pgfqpoint{1.345cm}{1.463cm}}{\pgfqpoint{1.331cm}{1.497cm}}{\pgfqpoint{1.305cm}{1.523cm}}
\pgfpathcurveto{\pgfqpoint{1.28cm}{1.549cm}}{\pgfqpoint{1.245cm}{1.563cm}}{\pgfqpoint{1.209cm}{1.563cm}}
\pgfpathcurveto{\pgfqpoint{1.172cm}{1.563cm}}{\pgfqpoint{1.138cm}{1.549cm}}{\pgfqpoint{1.112cm}{1.523cm}}
\pgfpathcurveto{\pgfqpoint{1.087cm}{1.497cm}}{\pgfqpoint{1.072cm}{1.463cm}}{\pgfqpoint{1.072cm}{1.426cm}}
\pgfpathcurveto{\pgfqpoint{1.072cm}{1.39cm}}{\pgfqpoint{1.087cm}{1.355cm}}{\pgfqpoint{1.112cm}{1.329cm}}
\pgfpathcurveto{\pgfqpoint{1.138cm}{1.304cm}}{\pgfqpoint{1.172cm}{1.289cm}}{\pgfqpoint{1.209cm}{1.289cm}}
\pgfpathcurveto{\pgfqpoint{1.245cm}{1.289cm}}{\pgfqpoint{1.28cm}{1.304cm}}{\pgfqpoint{1.305cm}{1.329cm}}
\pgfpathcurveto{\pgfqpoint{1.331cm}{1.355cm}}{\pgfqpoint{1.345cm}{1.39cm}}{\pgfqpoint{1.345cm}{1.426cm}}
\pgfusepath{fill}
\begin{pgfscope}
\pgfsetdash{}{0cm}
\pgfsetlinewidth{0.818mm}
\pgfsetroundcap
\pgfsetmiterlimit{4.0}
\pgfpathmoveto{\pgfqpoint{0.682cm}{0.726cm}}
\pgfpathlineto{\pgfqpoint{0.682cm}{0.097cm}}
\pgfusepath{stroke}
\end{pgfscope}
\end{pgfscope}
\end{pgfscope}
\end{pgfscope}
\end{tikzpicture}}} \right) \right] - \tau^{- \frac{1 + \nu}{2}} \left(
     \left[ 3 \tau^{\frac{1 + \nu}{2}} (- X^{\!\resizebox{0.6em}{!}{
\begin{tikzpicture}
\pgfpathmoveto{\pgfqpoint{0cm}{-0.035cm}}
\pgfpathlineto{\pgfqpoint{1.376cm}{-0.035cm}}
\pgfpathlineto{\pgfqpoint{1.376cm}{1.552cm}}
\pgfpathlineto{\pgfqpoint{0cm}{1.552cm}}
\pgfpathclose
\pgfusepath{clip}
\begin{pgfscope}
\begin{pgfscope}
\pgfpathmoveto{\pgfqpoint{0cm}{-0.035cm}}
\pgfpathlineto{\pgfqpoint{1.376cm}{-0.035cm}}
\pgfpathlineto{\pgfqpoint{1.376cm}{1.552cm}}
\pgfpathlineto{\pgfqpoint{0cm}{1.552cm}}
\pgfpathclose
\pgfusepath{clip}
\begin{pgfscope}
\begin{pgfscope}
\pgfsetdash{}{0cm}
\pgfsetlinewidth{0.818mm}
\pgfsetroundcap
\pgfsetroundjoin
\pgfsetmiterlimit{7.0}
\definecolor{eps2pgf_color}{gray}{0}\pgfsetstrokecolor{eps2pgf_color}\pgfsetfillcolor{eps2pgf_color}
\pgfpathmoveto{\pgfqpoint{0.117cm}{1.421cm}}
\pgfpathlineto{\pgfqpoint{0.682cm}{0.671cm}}
\pgfpathlineto{\pgfqpoint{1.246cm}{1.421cm}}
\pgfusepath{stroke}
\end{pgfscope}
\definecolor{eps2pgf_color}{gray}{0}\pgfsetstrokecolor{eps2pgf_color}\pgfsetfillcolor{eps2pgf_color}
\pgfpathmoveto{\pgfqpoint{0.273cm}{1.395cm}}
\pgfpathcurveto{\pgfqpoint{0.273cm}{1.432cm}}{\pgfqpoint{0.259cm}{1.467cm}}{\pgfqpoint{0.233cm}{1.492cm}}
\pgfpathcurveto{\pgfqpoint{0.207cm}{1.518cm}}{\pgfqpoint{0.173cm}{1.532cm}}{\pgfqpoint{0.137cm}{1.532cm}}
\pgfpathcurveto{\pgfqpoint{0.1cm}{1.532cm}}{\pgfqpoint{0.066cm}{1.518cm}}{\pgfqpoint{0.04cm}{1.492cm}}
\pgfpathcurveto{\pgfqpoint{0.014cm}{1.467cm}}{\pgfqpoint{0cm}{1.432cm}}{\pgfqpoint{0cm}{1.395cm}}
\pgfpathcurveto{\pgfqpoint{0cm}{1.359cm}}{\pgfqpoint{0.014cm}{1.324cm}}{\pgfqpoint{0.04cm}{1.299cm}}
\pgfpathcurveto{\pgfqpoint{0.066cm}{1.273cm}}{\pgfqpoint{0.1cm}{1.258cm}}{\pgfqpoint{0.137cm}{1.258cm}}
\pgfpathcurveto{\pgfqpoint{0.173cm}{1.258cm}}{\pgfqpoint{0.207cm}{1.273cm}}{\pgfqpoint{0.233cm}{1.299cm}}
\pgfpathcurveto{\pgfqpoint{0.259cm}{1.324cm}}{\pgfqpoint{0.273cm}{1.359cm}}{\pgfqpoint{0.273cm}{1.395cm}}
\pgfusepath{fill}
\begin{pgfscope}
\pgfsetdash{}{0cm}
\pgfsetlinewidth{0.818mm}
\pgfsetmiterlimit{7.0}
\pgfpathmoveto{\pgfqpoint{0.682cm}{0.671cm}}
\pgfpathlineto{\pgfqpoint{0.679cm}{1.418cm}}
\pgfusepath{stroke}
\end{pgfscope}
\pgfpathmoveto{\pgfqpoint{0.815cm}{1.399cm}}
\pgfpathcurveto{\pgfqpoint{0.815cm}{1.435cm}}{\pgfqpoint{0.801cm}{1.47cm}}{\pgfqpoint{0.775cm}{1.496cm}}
\pgfpathcurveto{\pgfqpoint{0.75cm}{1.521cm}}{\pgfqpoint{0.715cm}{1.536cm}}{\pgfqpoint{0.679cm}{1.536cm}}
\pgfpathcurveto{\pgfqpoint{0.643cm}{1.536cm}}{\pgfqpoint{0.608cm}{1.521cm}}{\pgfqpoint{0.582cm}{1.496cm}}
\pgfpathcurveto{\pgfqpoint{0.557cm}{1.47cm}}{\pgfqpoint{0.542cm}{1.435cm}}{\pgfqpoint{0.542cm}{1.399cm}}
\pgfpathcurveto{\pgfqpoint{0.542cm}{1.363cm}}{\pgfqpoint{0.557cm}{1.328cm}}{\pgfqpoint{0.582cm}{1.302cm}}
\pgfpathcurveto{\pgfqpoint{0.608cm}{1.276cm}}{\pgfqpoint{0.643cm}{1.262cm}}{\pgfqpoint{0.679cm}{1.262cm}}
\pgfpathcurveto{\pgfqpoint{0.715cm}{1.262cm}}{\pgfqpoint{0.75cm}{1.276cm}}{\pgfqpoint{0.775cm}{1.302cm}}
\pgfpathcurveto{\pgfqpoint{0.801cm}{1.328cm}}{\pgfqpoint{0.815cm}{1.363cm}}{\pgfqpoint{0.815cm}{1.399cm}}
\pgfusepath{fill}
\pgfpathmoveto{\pgfqpoint{1.345cm}{1.371cm}}
\pgfpathcurveto{\pgfqpoint{1.345cm}{1.408cm}}{\pgfqpoint{1.331cm}{1.442cm}}{\pgfqpoint{1.305cm}{1.468cm}}
\pgfpathcurveto{\pgfqpoint{1.28cm}{1.494cm}}{\pgfqpoint{1.245cm}{1.508cm}}{\pgfqpoint{1.209cm}{1.508cm}}
\pgfpathcurveto{\pgfqpoint{1.172cm}{1.508cm}}{\pgfqpoint{1.138cm}{1.494cm}}{\pgfqpoint{1.112cm}{1.468cm}}
\pgfpathcurveto{\pgfqpoint{1.087cm}{1.442cm}}{\pgfqpoint{1.072cm}{1.408cm}}{\pgfqpoint{1.072cm}{1.371cm}}
\pgfpathcurveto{\pgfqpoint{1.072cm}{1.335cm}}{\pgfqpoint{1.087cm}{1.3cm}}{\pgfqpoint{1.112cm}{1.274cm}}
\pgfpathcurveto{\pgfqpoint{1.138cm}{1.249cm}}{\pgfqpoint{1.172cm}{1.234cm}}{\pgfqpoint{1.209cm}{1.234cm}}
\pgfpathcurveto{\pgfqpoint{1.245cm}{1.234cm}}{\pgfqpoint{1.28cm}{1.249cm}}{\pgfqpoint{1.305cm}{1.274cm}}
\pgfpathcurveto{\pgfqpoint{1.331cm}{1.3cm}}{\pgfqpoint{1.345cm}{1.335cm}}{\pgfqpoint{1.345cm}{1.371cm}}
\pgfusepath{fill}
\begin{pgfscope}
\pgfsetdash{}{0cm}
\pgfsetlinewidth{0.818mm}
\pgfsetroundcap
\pgfsetmiterlimit{4.0}
\pgfpathmoveto{\pgfqpoint{0.682cm}{0.671cm}}
\pgfpathlineto{\pgfqpoint{0.682cm}{0.042cm}}
\pgfusepath{stroke}
\end{pgfscope}
\end{pgfscope}
\end{pgfscope}
\end{pgfscope}
\end{tikzpicture}}} + \phi + \psi) \right]
     \precprec\llbracket X^2\rrbracket \right) \right) \]
  \[ + 3 X (- X^{\!\resizebox{0.6em}{!}{
\begin{tikzpicture}
\pgfpathmoveto{\pgfqpoint{0cm}{-0.035cm}}
\pgfpathlineto{\pgfqpoint{1.376cm}{-0.035cm}}
\pgfpathlineto{\pgfqpoint{1.376cm}{1.552cm}}
\pgfpathlineto{\pgfqpoint{0cm}{1.552cm}}
\pgfpathclose
\pgfusepath{clip}
\begin{pgfscope}
\begin{pgfscope}
\pgfpathmoveto{\pgfqpoint{0cm}{-0.035cm}}
\pgfpathlineto{\pgfqpoint{1.376cm}{-0.035cm}}
\pgfpathlineto{\pgfqpoint{1.376cm}{1.552cm}}
\pgfpathlineto{\pgfqpoint{0cm}{1.552cm}}
\pgfpathclose
\pgfusepath{clip}
\begin{pgfscope}
\begin{pgfscope}
\pgfsetdash{}{0cm}
\pgfsetlinewidth{0.818mm}
\pgfsetroundcap
\pgfsetroundjoin
\pgfsetmiterlimit{7.0}
\definecolor{eps2pgf_color}{gray}{0}\pgfsetstrokecolor{eps2pgf_color}\pgfsetfillcolor{eps2pgf_color}
\pgfpathmoveto{\pgfqpoint{0.117cm}{1.421cm}}
\pgfpathlineto{\pgfqpoint{0.682cm}{0.671cm}}
\pgfpathlineto{\pgfqpoint{1.246cm}{1.421cm}}
\pgfusepath{stroke}
\end{pgfscope}
\definecolor{eps2pgf_color}{gray}{0}\pgfsetstrokecolor{eps2pgf_color}\pgfsetfillcolor{eps2pgf_color}
\pgfpathmoveto{\pgfqpoint{0.273cm}{1.395cm}}
\pgfpathcurveto{\pgfqpoint{0.273cm}{1.432cm}}{\pgfqpoint{0.259cm}{1.467cm}}{\pgfqpoint{0.233cm}{1.492cm}}
\pgfpathcurveto{\pgfqpoint{0.207cm}{1.518cm}}{\pgfqpoint{0.173cm}{1.532cm}}{\pgfqpoint{0.137cm}{1.532cm}}
\pgfpathcurveto{\pgfqpoint{0.1cm}{1.532cm}}{\pgfqpoint{0.066cm}{1.518cm}}{\pgfqpoint{0.04cm}{1.492cm}}
\pgfpathcurveto{\pgfqpoint{0.014cm}{1.467cm}}{\pgfqpoint{0cm}{1.432cm}}{\pgfqpoint{0cm}{1.395cm}}
\pgfpathcurveto{\pgfqpoint{0cm}{1.359cm}}{\pgfqpoint{0.014cm}{1.324cm}}{\pgfqpoint{0.04cm}{1.299cm}}
\pgfpathcurveto{\pgfqpoint{0.066cm}{1.273cm}}{\pgfqpoint{0.1cm}{1.258cm}}{\pgfqpoint{0.137cm}{1.258cm}}
\pgfpathcurveto{\pgfqpoint{0.173cm}{1.258cm}}{\pgfqpoint{0.207cm}{1.273cm}}{\pgfqpoint{0.233cm}{1.299cm}}
\pgfpathcurveto{\pgfqpoint{0.259cm}{1.324cm}}{\pgfqpoint{0.273cm}{1.359cm}}{\pgfqpoint{0.273cm}{1.395cm}}
\pgfusepath{fill}
\begin{pgfscope}
\pgfsetdash{}{0cm}
\pgfsetlinewidth{0.818mm}
\pgfsetmiterlimit{7.0}
\pgfpathmoveto{\pgfqpoint{0.682cm}{0.671cm}}
\pgfpathlineto{\pgfqpoint{0.679cm}{1.418cm}}
\pgfusepath{stroke}
\end{pgfscope}
\pgfpathmoveto{\pgfqpoint{0.815cm}{1.399cm}}
\pgfpathcurveto{\pgfqpoint{0.815cm}{1.435cm}}{\pgfqpoint{0.801cm}{1.47cm}}{\pgfqpoint{0.775cm}{1.496cm}}
\pgfpathcurveto{\pgfqpoint{0.75cm}{1.521cm}}{\pgfqpoint{0.715cm}{1.536cm}}{\pgfqpoint{0.679cm}{1.536cm}}
\pgfpathcurveto{\pgfqpoint{0.643cm}{1.536cm}}{\pgfqpoint{0.608cm}{1.521cm}}{\pgfqpoint{0.582cm}{1.496cm}}
\pgfpathcurveto{\pgfqpoint{0.557cm}{1.47cm}}{\pgfqpoint{0.542cm}{1.435cm}}{\pgfqpoint{0.542cm}{1.399cm}}
\pgfpathcurveto{\pgfqpoint{0.542cm}{1.363cm}}{\pgfqpoint{0.557cm}{1.328cm}}{\pgfqpoint{0.582cm}{1.302cm}}
\pgfpathcurveto{\pgfqpoint{0.608cm}{1.276cm}}{\pgfqpoint{0.643cm}{1.262cm}}{\pgfqpoint{0.679cm}{1.262cm}}
\pgfpathcurveto{\pgfqpoint{0.715cm}{1.262cm}}{\pgfqpoint{0.75cm}{1.276cm}}{\pgfqpoint{0.775cm}{1.302cm}}
\pgfpathcurveto{\pgfqpoint{0.801cm}{1.328cm}}{\pgfqpoint{0.815cm}{1.363cm}}{\pgfqpoint{0.815cm}{1.399cm}}
\pgfusepath{fill}
\pgfpathmoveto{\pgfqpoint{1.345cm}{1.371cm}}
\pgfpathcurveto{\pgfqpoint{1.345cm}{1.408cm}}{\pgfqpoint{1.331cm}{1.442cm}}{\pgfqpoint{1.305cm}{1.468cm}}
\pgfpathcurveto{\pgfqpoint{1.28cm}{1.494cm}}{\pgfqpoint{1.245cm}{1.508cm}}{\pgfqpoint{1.209cm}{1.508cm}}
\pgfpathcurveto{\pgfqpoint{1.172cm}{1.508cm}}{\pgfqpoint{1.138cm}{1.494cm}}{\pgfqpoint{1.112cm}{1.468cm}}
\pgfpathcurveto{\pgfqpoint{1.087cm}{1.442cm}}{\pgfqpoint{1.072cm}{1.408cm}}{\pgfqpoint{1.072cm}{1.371cm}}
\pgfpathcurveto{\pgfqpoint{1.072cm}{1.335cm}}{\pgfqpoint{1.087cm}{1.3cm}}{\pgfqpoint{1.112cm}{1.274cm}}
\pgfpathcurveto{\pgfqpoint{1.138cm}{1.249cm}}{\pgfqpoint{1.172cm}{1.234cm}}{\pgfqpoint{1.209cm}{1.234cm}}
\pgfpathcurveto{\pgfqpoint{1.245cm}{1.234cm}}{\pgfqpoint{1.28cm}{1.249cm}}{\pgfqpoint{1.305cm}{1.274cm}}
\pgfpathcurveto{\pgfqpoint{1.331cm}{1.3cm}}{\pgfqpoint{1.345cm}{1.335cm}}{\pgfqpoint{1.345cm}{1.371cm}}
\pgfusepath{fill}
\begin{pgfscope}
\pgfsetdash{}{0cm}
\pgfsetlinewidth{0.818mm}
\pgfsetroundcap
\pgfsetmiterlimit{4.0}
\pgfpathmoveto{\pgfqpoint{0.682cm}{0.671cm}}
\pgfpathlineto{\pgfqpoint{0.682cm}{0.042cm}}
\pgfusepath{stroke}
\end{pgfscope}
\end{pgfscope}
\end{pgfscope}
\end{pgfscope}
\end{tikzpicture}}} + \phi + \psi)^2 + (- X^{\!\resizebox{0.6em}{!}{
\begin{tikzpicture}
\pgfpathmoveto{\pgfqpoint{0cm}{-0.035cm}}
\pgfpathlineto{\pgfqpoint{1.376cm}{-0.035cm}}
\pgfpathlineto{\pgfqpoint{1.376cm}{1.552cm}}
\pgfpathlineto{\pgfqpoint{0cm}{1.552cm}}
\pgfpathclose
\pgfusepath{clip}
\begin{pgfscope}
\begin{pgfscope}
\pgfpathmoveto{\pgfqpoint{0cm}{-0.035cm}}
\pgfpathlineto{\pgfqpoint{1.376cm}{-0.035cm}}
\pgfpathlineto{\pgfqpoint{1.376cm}{1.552cm}}
\pgfpathlineto{\pgfqpoint{0cm}{1.552cm}}
\pgfpathclose
\pgfusepath{clip}
\begin{pgfscope}
\begin{pgfscope}
\pgfsetdash{}{0cm}
\pgfsetlinewidth{0.818mm}
\pgfsetroundcap
\pgfsetroundjoin
\pgfsetmiterlimit{7.0}
\definecolor{eps2pgf_color}{gray}{0}\pgfsetstrokecolor{eps2pgf_color}\pgfsetfillcolor{eps2pgf_color}
\pgfpathmoveto{\pgfqpoint{0.117cm}{1.421cm}}
\pgfpathlineto{\pgfqpoint{0.682cm}{0.671cm}}
\pgfpathlineto{\pgfqpoint{1.246cm}{1.421cm}}
\pgfusepath{stroke}
\end{pgfscope}
\definecolor{eps2pgf_color}{gray}{0}\pgfsetstrokecolor{eps2pgf_color}\pgfsetfillcolor{eps2pgf_color}
\pgfpathmoveto{\pgfqpoint{0.273cm}{1.395cm}}
\pgfpathcurveto{\pgfqpoint{0.273cm}{1.432cm}}{\pgfqpoint{0.259cm}{1.467cm}}{\pgfqpoint{0.233cm}{1.492cm}}
\pgfpathcurveto{\pgfqpoint{0.207cm}{1.518cm}}{\pgfqpoint{0.173cm}{1.532cm}}{\pgfqpoint{0.137cm}{1.532cm}}
\pgfpathcurveto{\pgfqpoint{0.1cm}{1.532cm}}{\pgfqpoint{0.066cm}{1.518cm}}{\pgfqpoint{0.04cm}{1.492cm}}
\pgfpathcurveto{\pgfqpoint{0.014cm}{1.467cm}}{\pgfqpoint{0cm}{1.432cm}}{\pgfqpoint{0cm}{1.395cm}}
\pgfpathcurveto{\pgfqpoint{0cm}{1.359cm}}{\pgfqpoint{0.014cm}{1.324cm}}{\pgfqpoint{0.04cm}{1.299cm}}
\pgfpathcurveto{\pgfqpoint{0.066cm}{1.273cm}}{\pgfqpoint{0.1cm}{1.258cm}}{\pgfqpoint{0.137cm}{1.258cm}}
\pgfpathcurveto{\pgfqpoint{0.173cm}{1.258cm}}{\pgfqpoint{0.207cm}{1.273cm}}{\pgfqpoint{0.233cm}{1.299cm}}
\pgfpathcurveto{\pgfqpoint{0.259cm}{1.324cm}}{\pgfqpoint{0.273cm}{1.359cm}}{\pgfqpoint{0.273cm}{1.395cm}}
\pgfusepath{fill}
\begin{pgfscope}
\pgfsetdash{}{0cm}
\pgfsetlinewidth{0.818mm}
\pgfsetmiterlimit{7.0}
\pgfpathmoveto{\pgfqpoint{0.682cm}{0.671cm}}
\pgfpathlineto{\pgfqpoint{0.679cm}{1.418cm}}
\pgfusepath{stroke}
\end{pgfscope}
\pgfpathmoveto{\pgfqpoint{0.815cm}{1.399cm}}
\pgfpathcurveto{\pgfqpoint{0.815cm}{1.435cm}}{\pgfqpoint{0.801cm}{1.47cm}}{\pgfqpoint{0.775cm}{1.496cm}}
\pgfpathcurveto{\pgfqpoint{0.75cm}{1.521cm}}{\pgfqpoint{0.715cm}{1.536cm}}{\pgfqpoint{0.679cm}{1.536cm}}
\pgfpathcurveto{\pgfqpoint{0.643cm}{1.536cm}}{\pgfqpoint{0.608cm}{1.521cm}}{\pgfqpoint{0.582cm}{1.496cm}}
\pgfpathcurveto{\pgfqpoint{0.557cm}{1.47cm}}{\pgfqpoint{0.542cm}{1.435cm}}{\pgfqpoint{0.542cm}{1.399cm}}
\pgfpathcurveto{\pgfqpoint{0.542cm}{1.363cm}}{\pgfqpoint{0.557cm}{1.328cm}}{\pgfqpoint{0.582cm}{1.302cm}}
\pgfpathcurveto{\pgfqpoint{0.608cm}{1.276cm}}{\pgfqpoint{0.643cm}{1.262cm}}{\pgfqpoint{0.679cm}{1.262cm}}
\pgfpathcurveto{\pgfqpoint{0.715cm}{1.262cm}}{\pgfqpoint{0.75cm}{1.276cm}}{\pgfqpoint{0.775cm}{1.302cm}}
\pgfpathcurveto{\pgfqpoint{0.801cm}{1.328cm}}{\pgfqpoint{0.815cm}{1.363cm}}{\pgfqpoint{0.815cm}{1.399cm}}
\pgfusepath{fill}
\pgfpathmoveto{\pgfqpoint{1.345cm}{1.371cm}}
\pgfpathcurveto{\pgfqpoint{1.345cm}{1.408cm}}{\pgfqpoint{1.331cm}{1.442cm}}{\pgfqpoint{1.305cm}{1.468cm}}
\pgfpathcurveto{\pgfqpoint{1.28cm}{1.494cm}}{\pgfqpoint{1.245cm}{1.508cm}}{\pgfqpoint{1.209cm}{1.508cm}}
\pgfpathcurveto{\pgfqpoint{1.172cm}{1.508cm}}{\pgfqpoint{1.138cm}{1.494cm}}{\pgfqpoint{1.112cm}{1.468cm}}
\pgfpathcurveto{\pgfqpoint{1.087cm}{1.442cm}}{\pgfqpoint{1.072cm}{1.408cm}}{\pgfqpoint{1.072cm}{1.371cm}}
\pgfpathcurveto{\pgfqpoint{1.072cm}{1.335cm}}{\pgfqpoint{1.087cm}{1.3cm}}{\pgfqpoint{1.112cm}{1.274cm}}
\pgfpathcurveto{\pgfqpoint{1.138cm}{1.249cm}}{\pgfqpoint{1.172cm}{1.234cm}}{\pgfqpoint{1.209cm}{1.234cm}}
\pgfpathcurveto{\pgfqpoint{1.245cm}{1.234cm}}{\pgfqpoint{1.28cm}{1.249cm}}{\pgfqpoint{1.305cm}{1.274cm}}
\pgfpathcurveto{\pgfqpoint{1.331cm}{1.3cm}}{\pgfqpoint{1.345cm}{1.335cm}}{\pgfqpoint{1.345cm}{1.371cm}}
\pgfusepath{fill}
\begin{pgfscope}
\pgfsetdash{}{0cm}
\pgfsetlinewidth{0.818mm}
\pgfsetroundcap
\pgfsetmiterlimit{4.0}
\pgfpathmoveto{\pgfqpoint{0.682cm}{0.671cm}}
\pgfpathlineto{\pgfqpoint{0.682cm}{0.042cm}}
\pgfusepath{stroke}
\end{pgfscope}
\end{pgfscope}
\end{pgfscope}
\end{pgfscope}
\end{tikzpicture}}} + \phi +
     \psi)^3 + 3 b \varphi \]
  as a replacement of \eqref{eq:rhs43a}. Similarly
  to Section \ref{sec:43}, we obtain $\LL \breve{\vartheta} +
  \breve{\Theta} = 0$ with $\breve{\vartheta}(0)=\phi(0)$ and where $\breve{\Theta}$ differs from $\Theta$ only in the
  two commutators above, which we denote by $\mathrm{com_{1}},\,\mathrm{com_{2}}$ according to their order of appearance.
  
  Recall that in Section \ref{sec:45} and Section
  \ref{sec:43}, a suitable bound for $\vartheta$ was only needed in order to
  control the term $\llbracket X^2\rrbracket  \circ \vartheta$ in the equation for $\psi$. To
  this end, the choice of the weight $\rho^{2 + \alpha}$ in Section
  \ref{ssec:theta} was rather arbitrary, which has already been observed in Section \ref{sec:43}. Indeed, in order to control $\llbracket X^2\rrbracket 
  \circ \vartheta$ in $\CC^{\gamma} (\rho^{3 + \gamma})$ (cf. Section
  \ref{ssec:psi1}) it is sufficient to bound $\vartheta$ in
  $\CC^{1 + \alpha} (\rho^{3 + \gamma'})$ for some $0 < \gamma' < \gamma$.
 Similarly to Section \ref{sec:43}, we make use of this flexibility here: our goal is to apply Lemma
  \ref{lemma:tau-weighted-schauder} in order to estimate $\breve{\vartheta}$
  in $C \CC^{1 + \alpha}_{} (\tau^{(2 + \alpha) / 2} \rho_{}^{3 + \gamma'})$. This higher weight will allow us to control the term coming from $\Psi$ in the bound for time regularity below, see \eqref{eq:tregpsi}.

  Let us first estimate the new commutators appearing in $\breve{\Theta} .$ We
  rewrite the first commutator as
  \[ \tmop{com}_1= \tau^{- \frac{1 + \nu}{2}} \left( 3 \left[
     \tau^{\frac{1 + \nu}{2}} (- X^{\!\resizebox{0.6em}{!}{
\begin{tikzpicture}
\pgfpathmoveto{\pgfqpoint{0cm}{-0.035cm}}
\pgfpathlineto{\pgfqpoint{1.376cm}{-0.035cm}}
\pgfpathlineto{\pgfqpoint{1.376cm}{1.552cm}}
\pgfpathlineto{\pgfqpoint{0cm}{1.552cm}}
\pgfpathclose
\pgfusepath{clip}
\begin{pgfscope}
\begin{pgfscope}
\pgfpathmoveto{\pgfqpoint{0cm}{-0.035cm}}
\pgfpathlineto{\pgfqpoint{1.376cm}{-0.035cm}}
\pgfpathlineto{\pgfqpoint{1.376cm}{1.552cm}}
\pgfpathlineto{\pgfqpoint{0cm}{1.552cm}}
\pgfpathclose
\pgfusepath{clip}
\begin{pgfscope}
\begin{pgfscope}
\pgfsetdash{}{0cm}
\pgfsetlinewidth{0.818mm}
\pgfsetroundcap
\pgfsetroundjoin
\pgfsetmiterlimit{7.0}
\definecolor{eps2pgf_color}{gray}{0}\pgfsetstrokecolor{eps2pgf_color}\pgfsetfillcolor{eps2pgf_color}
\pgfpathmoveto{\pgfqpoint{0.117cm}{1.421cm}}
\pgfpathlineto{\pgfqpoint{0.682cm}{0.671cm}}
\pgfpathlineto{\pgfqpoint{1.246cm}{1.421cm}}
\pgfusepath{stroke}
\end{pgfscope}
\definecolor{eps2pgf_color}{gray}{0}\pgfsetstrokecolor{eps2pgf_color}\pgfsetfillcolor{eps2pgf_color}
\pgfpathmoveto{\pgfqpoint{0.273cm}{1.395cm}}
\pgfpathcurveto{\pgfqpoint{0.273cm}{1.432cm}}{\pgfqpoint{0.259cm}{1.467cm}}{\pgfqpoint{0.233cm}{1.492cm}}
\pgfpathcurveto{\pgfqpoint{0.207cm}{1.518cm}}{\pgfqpoint{0.173cm}{1.532cm}}{\pgfqpoint{0.137cm}{1.532cm}}
\pgfpathcurveto{\pgfqpoint{0.1cm}{1.532cm}}{\pgfqpoint{0.066cm}{1.518cm}}{\pgfqpoint{0.04cm}{1.492cm}}
\pgfpathcurveto{\pgfqpoint{0.014cm}{1.467cm}}{\pgfqpoint{0cm}{1.432cm}}{\pgfqpoint{0cm}{1.395cm}}
\pgfpathcurveto{\pgfqpoint{0cm}{1.359cm}}{\pgfqpoint{0.014cm}{1.324cm}}{\pgfqpoint{0.04cm}{1.299cm}}
\pgfpathcurveto{\pgfqpoint{0.066cm}{1.273cm}}{\pgfqpoint{0.1cm}{1.258cm}}{\pgfqpoint{0.137cm}{1.258cm}}
\pgfpathcurveto{\pgfqpoint{0.173cm}{1.258cm}}{\pgfqpoint{0.207cm}{1.273cm}}{\pgfqpoint{0.233cm}{1.299cm}}
\pgfpathcurveto{\pgfqpoint{0.259cm}{1.324cm}}{\pgfqpoint{0.273cm}{1.359cm}}{\pgfqpoint{0.273cm}{1.395cm}}
\pgfusepath{fill}
\begin{pgfscope}
\pgfsetdash{}{0cm}
\pgfsetlinewidth{0.818mm}
\pgfsetmiterlimit{7.0}
\pgfpathmoveto{\pgfqpoint{0.682cm}{0.671cm}}
\pgfpathlineto{\pgfqpoint{0.679cm}{1.418cm}}
\pgfusepath{stroke}
\end{pgfscope}
\pgfpathmoveto{\pgfqpoint{0.815cm}{1.399cm}}
\pgfpathcurveto{\pgfqpoint{0.815cm}{1.435cm}}{\pgfqpoint{0.801cm}{1.47cm}}{\pgfqpoint{0.775cm}{1.496cm}}
\pgfpathcurveto{\pgfqpoint{0.75cm}{1.521cm}}{\pgfqpoint{0.715cm}{1.536cm}}{\pgfqpoint{0.679cm}{1.536cm}}
\pgfpathcurveto{\pgfqpoint{0.643cm}{1.536cm}}{\pgfqpoint{0.608cm}{1.521cm}}{\pgfqpoint{0.582cm}{1.496cm}}
\pgfpathcurveto{\pgfqpoint{0.557cm}{1.47cm}}{\pgfqpoint{0.542cm}{1.435cm}}{\pgfqpoint{0.542cm}{1.399cm}}
\pgfpathcurveto{\pgfqpoint{0.542cm}{1.363cm}}{\pgfqpoint{0.557cm}{1.328cm}}{\pgfqpoint{0.582cm}{1.302cm}}
\pgfpathcurveto{\pgfqpoint{0.608cm}{1.276cm}}{\pgfqpoint{0.643cm}{1.262cm}}{\pgfqpoint{0.679cm}{1.262cm}}
\pgfpathcurveto{\pgfqpoint{0.715cm}{1.262cm}}{\pgfqpoint{0.75cm}{1.276cm}}{\pgfqpoint{0.775cm}{1.302cm}}
\pgfpathcurveto{\pgfqpoint{0.801cm}{1.328cm}}{\pgfqpoint{0.815cm}{1.363cm}}{\pgfqpoint{0.815cm}{1.399cm}}
\pgfusepath{fill}
\pgfpathmoveto{\pgfqpoint{1.345cm}{1.371cm}}
\pgfpathcurveto{\pgfqpoint{1.345cm}{1.408cm}}{\pgfqpoint{1.331cm}{1.442cm}}{\pgfqpoint{1.305cm}{1.468cm}}
\pgfpathcurveto{\pgfqpoint{1.28cm}{1.494cm}}{\pgfqpoint{1.245cm}{1.508cm}}{\pgfqpoint{1.209cm}{1.508cm}}
\pgfpathcurveto{\pgfqpoint{1.172cm}{1.508cm}}{\pgfqpoint{1.138cm}{1.494cm}}{\pgfqpoint{1.112cm}{1.468cm}}
\pgfpathcurveto{\pgfqpoint{1.087cm}{1.442cm}}{\pgfqpoint{1.072cm}{1.408cm}}{\pgfqpoint{1.072cm}{1.371cm}}
\pgfpathcurveto{\pgfqpoint{1.072cm}{1.335cm}}{\pgfqpoint{1.087cm}{1.3cm}}{\pgfqpoint{1.112cm}{1.274cm}}
\pgfpathcurveto{\pgfqpoint{1.138cm}{1.249cm}}{\pgfqpoint{1.172cm}{1.234cm}}{\pgfqpoint{1.209cm}{1.234cm}}
\pgfpathcurveto{\pgfqpoint{1.245cm}{1.234cm}}{\pgfqpoint{1.28cm}{1.249cm}}{\pgfqpoint{1.305cm}{1.274cm}}
\pgfpathcurveto{\pgfqpoint{1.331cm}{1.3cm}}{\pgfqpoint{1.345cm}{1.335cm}}{\pgfqpoint{1.345cm}{1.371cm}}
\pgfusepath{fill}
\begin{pgfscope}
\pgfsetdash{}{0cm}
\pgfsetlinewidth{0.818mm}
\pgfsetroundcap
\pgfsetmiterlimit{4.0}
\pgfpathmoveto{\pgfqpoint{0.682cm}{0.671cm}}
\pgfpathlineto{\pgfqpoint{0.682cm}{0.042cm}}
\pgfusepath{stroke}
\end{pgfscope}
\end{pgfscope}
\end{pgfscope}
\end{pgfscope}
\end{tikzpicture}}} + \phi + \psi) \right]
     \precprec \llbracket X^2\rrbracket - 3 \left[ \tau^{\frac{1 + \nu}{2}} (- X^{\!\resizebox{0.6em}{!}{
\begin{tikzpicture}
\pgfpathmoveto{\pgfqpoint{0cm}{-0.035cm}}
\pgfpathlineto{\pgfqpoint{1.376cm}{-0.035cm}}
\pgfpathlineto{\pgfqpoint{1.376cm}{1.552cm}}
\pgfpathlineto{\pgfqpoint{0cm}{1.552cm}}
\pgfpathclose
\pgfusepath{clip}
\begin{pgfscope}
\begin{pgfscope}
\pgfpathmoveto{\pgfqpoint{0cm}{-0.035cm}}
\pgfpathlineto{\pgfqpoint{1.376cm}{-0.035cm}}
\pgfpathlineto{\pgfqpoint{1.376cm}{1.552cm}}
\pgfpathlineto{\pgfqpoint{0cm}{1.552cm}}
\pgfpathclose
\pgfusepath{clip}
\begin{pgfscope}
\begin{pgfscope}
\pgfsetdash{}{0cm}
\pgfsetlinewidth{0.818mm}
\pgfsetroundcap
\pgfsetroundjoin
\pgfsetmiterlimit{7.0}
\definecolor{eps2pgf_color}{gray}{0}\pgfsetstrokecolor{eps2pgf_color}\pgfsetfillcolor{eps2pgf_color}
\pgfpathmoveto{\pgfqpoint{0.117cm}{1.421cm}}
\pgfpathlineto{\pgfqpoint{0.682cm}{0.671cm}}
\pgfpathlineto{\pgfqpoint{1.246cm}{1.421cm}}
\pgfusepath{stroke}
\end{pgfscope}
\definecolor{eps2pgf_color}{gray}{0}\pgfsetstrokecolor{eps2pgf_color}\pgfsetfillcolor{eps2pgf_color}
\pgfpathmoveto{\pgfqpoint{0.273cm}{1.395cm}}
\pgfpathcurveto{\pgfqpoint{0.273cm}{1.432cm}}{\pgfqpoint{0.259cm}{1.467cm}}{\pgfqpoint{0.233cm}{1.492cm}}
\pgfpathcurveto{\pgfqpoint{0.207cm}{1.518cm}}{\pgfqpoint{0.173cm}{1.532cm}}{\pgfqpoint{0.137cm}{1.532cm}}
\pgfpathcurveto{\pgfqpoint{0.1cm}{1.532cm}}{\pgfqpoint{0.066cm}{1.518cm}}{\pgfqpoint{0.04cm}{1.492cm}}
\pgfpathcurveto{\pgfqpoint{0.014cm}{1.467cm}}{\pgfqpoint{0cm}{1.432cm}}{\pgfqpoint{0cm}{1.395cm}}
\pgfpathcurveto{\pgfqpoint{0cm}{1.359cm}}{\pgfqpoint{0.014cm}{1.324cm}}{\pgfqpoint{0.04cm}{1.299cm}}
\pgfpathcurveto{\pgfqpoint{0.066cm}{1.273cm}}{\pgfqpoint{0.1cm}{1.258cm}}{\pgfqpoint{0.137cm}{1.258cm}}
\pgfpathcurveto{\pgfqpoint{0.173cm}{1.258cm}}{\pgfqpoint{0.207cm}{1.273cm}}{\pgfqpoint{0.233cm}{1.299cm}}
\pgfpathcurveto{\pgfqpoint{0.259cm}{1.324cm}}{\pgfqpoint{0.273cm}{1.359cm}}{\pgfqpoint{0.273cm}{1.395cm}}
\pgfusepath{fill}
\begin{pgfscope}
\pgfsetdash{}{0cm}
\pgfsetlinewidth{0.818mm}
\pgfsetmiterlimit{7.0}
\pgfpathmoveto{\pgfqpoint{0.682cm}{0.671cm}}
\pgfpathlineto{\pgfqpoint{0.679cm}{1.418cm}}
\pgfusepath{stroke}
\end{pgfscope}
\pgfpathmoveto{\pgfqpoint{0.815cm}{1.399cm}}
\pgfpathcurveto{\pgfqpoint{0.815cm}{1.435cm}}{\pgfqpoint{0.801cm}{1.47cm}}{\pgfqpoint{0.775cm}{1.496cm}}
\pgfpathcurveto{\pgfqpoint{0.75cm}{1.521cm}}{\pgfqpoint{0.715cm}{1.536cm}}{\pgfqpoint{0.679cm}{1.536cm}}
\pgfpathcurveto{\pgfqpoint{0.643cm}{1.536cm}}{\pgfqpoint{0.608cm}{1.521cm}}{\pgfqpoint{0.582cm}{1.496cm}}
\pgfpathcurveto{\pgfqpoint{0.557cm}{1.47cm}}{\pgfqpoint{0.542cm}{1.435cm}}{\pgfqpoint{0.542cm}{1.399cm}}
\pgfpathcurveto{\pgfqpoint{0.542cm}{1.363cm}}{\pgfqpoint{0.557cm}{1.328cm}}{\pgfqpoint{0.582cm}{1.302cm}}
\pgfpathcurveto{\pgfqpoint{0.608cm}{1.276cm}}{\pgfqpoint{0.643cm}{1.262cm}}{\pgfqpoint{0.679cm}{1.262cm}}
\pgfpathcurveto{\pgfqpoint{0.715cm}{1.262cm}}{\pgfqpoint{0.75cm}{1.276cm}}{\pgfqpoint{0.775cm}{1.302cm}}
\pgfpathcurveto{\pgfqpoint{0.801cm}{1.328cm}}{\pgfqpoint{0.815cm}{1.363cm}}{\pgfqpoint{0.815cm}{1.399cm}}
\pgfusepath{fill}
\pgfpathmoveto{\pgfqpoint{1.345cm}{1.371cm}}
\pgfpathcurveto{\pgfqpoint{1.345cm}{1.408cm}}{\pgfqpoint{1.331cm}{1.442cm}}{\pgfqpoint{1.305cm}{1.468cm}}
\pgfpathcurveto{\pgfqpoint{1.28cm}{1.494cm}}{\pgfqpoint{1.245cm}{1.508cm}}{\pgfqpoint{1.209cm}{1.508cm}}
\pgfpathcurveto{\pgfqpoint{1.172cm}{1.508cm}}{\pgfqpoint{1.138cm}{1.494cm}}{\pgfqpoint{1.112cm}{1.468cm}}
\pgfpathcurveto{\pgfqpoint{1.087cm}{1.442cm}}{\pgfqpoint{1.072cm}{1.408cm}}{\pgfqpoint{1.072cm}{1.371cm}}
\pgfpathcurveto{\pgfqpoint{1.072cm}{1.335cm}}{\pgfqpoint{1.087cm}{1.3cm}}{\pgfqpoint{1.112cm}{1.274cm}}
\pgfpathcurveto{\pgfqpoint{1.138cm}{1.249cm}}{\pgfqpoint{1.172cm}{1.234cm}}{\pgfqpoint{1.209cm}{1.234cm}}
\pgfpathcurveto{\pgfqpoint{1.245cm}{1.234cm}}{\pgfqpoint{1.28cm}{1.249cm}}{\pgfqpoint{1.305cm}{1.274cm}}
\pgfpathcurveto{\pgfqpoint{1.331cm}{1.3cm}}{\pgfqpoint{1.345cm}{1.335cm}}{\pgfqpoint{1.345cm}{1.371cm}}
\pgfusepath{fill}
\begin{pgfscope}
\pgfsetdash{}{0cm}
\pgfsetlinewidth{0.818mm}
\pgfsetroundcap
\pgfsetmiterlimit{4.0}
\pgfpathmoveto{\pgfqpoint{0.682cm}{0.671cm}}
\pgfpathlineto{\pgfqpoint{0.682cm}{0.042cm}}
\pgfusepath{stroke}
\end{pgfscope}
\end{pgfscope}
\end{pgfscope}
\end{pgfscope}
\end{tikzpicture}}} +
     \phi + \psi) \right] \prec \llbracket X^2\rrbracket \right), \]
  and observe that by Lemma \ref{lem:5.1} it can be estimated in $\CC^{- 1 +
  \alpha} (\rho_{}^{3 + \gamma'})$, using the time regularity of
  $\tau^{\frac{1 + \nu}{2}} (- X^{\!\resizebox{0.6em}{!}{
\begin{tikzpicture}
\pgfpathmoveto{\pgfqpoint{0cm}{-0.035cm}}
\pgfpathlineto{\pgfqpoint{1.376cm}{-0.035cm}}
\pgfpathlineto{\pgfqpoint{1.376cm}{1.552cm}}
\pgfpathlineto{\pgfqpoint{0cm}{1.552cm}}
\pgfpathclose
\pgfusepath{clip}
\begin{pgfscope}
\begin{pgfscope}
\pgfpathmoveto{\pgfqpoint{0cm}{-0.035cm}}
\pgfpathlineto{\pgfqpoint{1.376cm}{-0.035cm}}
\pgfpathlineto{\pgfqpoint{1.376cm}{1.552cm}}
\pgfpathlineto{\pgfqpoint{0cm}{1.552cm}}
\pgfpathclose
\pgfusepath{clip}
\begin{pgfscope}
\begin{pgfscope}
\pgfsetdash{}{0cm}
\pgfsetlinewidth{0.818mm}
\pgfsetroundcap
\pgfsetroundjoin
\pgfsetmiterlimit{7.0}
\definecolor{eps2pgf_color}{gray}{0}\pgfsetstrokecolor{eps2pgf_color}\pgfsetfillcolor{eps2pgf_color}
\pgfpathmoveto{\pgfqpoint{0.117cm}{1.421cm}}
\pgfpathlineto{\pgfqpoint{0.682cm}{0.671cm}}
\pgfpathlineto{\pgfqpoint{1.246cm}{1.421cm}}
\pgfusepath{stroke}
\end{pgfscope}
\definecolor{eps2pgf_color}{gray}{0}\pgfsetstrokecolor{eps2pgf_color}\pgfsetfillcolor{eps2pgf_color}
\pgfpathmoveto{\pgfqpoint{0.273cm}{1.395cm}}
\pgfpathcurveto{\pgfqpoint{0.273cm}{1.432cm}}{\pgfqpoint{0.259cm}{1.467cm}}{\pgfqpoint{0.233cm}{1.492cm}}
\pgfpathcurveto{\pgfqpoint{0.207cm}{1.518cm}}{\pgfqpoint{0.173cm}{1.532cm}}{\pgfqpoint{0.137cm}{1.532cm}}
\pgfpathcurveto{\pgfqpoint{0.1cm}{1.532cm}}{\pgfqpoint{0.066cm}{1.518cm}}{\pgfqpoint{0.04cm}{1.492cm}}
\pgfpathcurveto{\pgfqpoint{0.014cm}{1.467cm}}{\pgfqpoint{0cm}{1.432cm}}{\pgfqpoint{0cm}{1.395cm}}
\pgfpathcurveto{\pgfqpoint{0cm}{1.359cm}}{\pgfqpoint{0.014cm}{1.324cm}}{\pgfqpoint{0.04cm}{1.299cm}}
\pgfpathcurveto{\pgfqpoint{0.066cm}{1.273cm}}{\pgfqpoint{0.1cm}{1.258cm}}{\pgfqpoint{0.137cm}{1.258cm}}
\pgfpathcurveto{\pgfqpoint{0.173cm}{1.258cm}}{\pgfqpoint{0.207cm}{1.273cm}}{\pgfqpoint{0.233cm}{1.299cm}}
\pgfpathcurveto{\pgfqpoint{0.259cm}{1.324cm}}{\pgfqpoint{0.273cm}{1.359cm}}{\pgfqpoint{0.273cm}{1.395cm}}
\pgfusepath{fill}
\begin{pgfscope}
\pgfsetdash{}{0cm}
\pgfsetlinewidth{0.818mm}
\pgfsetmiterlimit{7.0}
\pgfpathmoveto{\pgfqpoint{0.682cm}{0.671cm}}
\pgfpathlineto{\pgfqpoint{0.679cm}{1.418cm}}
\pgfusepath{stroke}
\end{pgfscope}
\pgfpathmoveto{\pgfqpoint{0.815cm}{1.399cm}}
\pgfpathcurveto{\pgfqpoint{0.815cm}{1.435cm}}{\pgfqpoint{0.801cm}{1.47cm}}{\pgfqpoint{0.775cm}{1.496cm}}
\pgfpathcurveto{\pgfqpoint{0.75cm}{1.521cm}}{\pgfqpoint{0.715cm}{1.536cm}}{\pgfqpoint{0.679cm}{1.536cm}}
\pgfpathcurveto{\pgfqpoint{0.643cm}{1.536cm}}{\pgfqpoint{0.608cm}{1.521cm}}{\pgfqpoint{0.582cm}{1.496cm}}
\pgfpathcurveto{\pgfqpoint{0.557cm}{1.47cm}}{\pgfqpoint{0.542cm}{1.435cm}}{\pgfqpoint{0.542cm}{1.399cm}}
\pgfpathcurveto{\pgfqpoint{0.542cm}{1.363cm}}{\pgfqpoint{0.557cm}{1.328cm}}{\pgfqpoint{0.582cm}{1.302cm}}
\pgfpathcurveto{\pgfqpoint{0.608cm}{1.276cm}}{\pgfqpoint{0.643cm}{1.262cm}}{\pgfqpoint{0.679cm}{1.262cm}}
\pgfpathcurveto{\pgfqpoint{0.715cm}{1.262cm}}{\pgfqpoint{0.75cm}{1.276cm}}{\pgfqpoint{0.775cm}{1.302cm}}
\pgfpathcurveto{\pgfqpoint{0.801cm}{1.328cm}}{\pgfqpoint{0.815cm}{1.363cm}}{\pgfqpoint{0.815cm}{1.399cm}}
\pgfusepath{fill}
\pgfpathmoveto{\pgfqpoint{1.345cm}{1.371cm}}
\pgfpathcurveto{\pgfqpoint{1.345cm}{1.408cm}}{\pgfqpoint{1.331cm}{1.442cm}}{\pgfqpoint{1.305cm}{1.468cm}}
\pgfpathcurveto{\pgfqpoint{1.28cm}{1.494cm}}{\pgfqpoint{1.245cm}{1.508cm}}{\pgfqpoint{1.209cm}{1.508cm}}
\pgfpathcurveto{\pgfqpoint{1.172cm}{1.508cm}}{\pgfqpoint{1.138cm}{1.494cm}}{\pgfqpoint{1.112cm}{1.468cm}}
\pgfpathcurveto{\pgfqpoint{1.087cm}{1.442cm}}{\pgfqpoint{1.072cm}{1.408cm}}{\pgfqpoint{1.072cm}{1.371cm}}
\pgfpathcurveto{\pgfqpoint{1.072cm}{1.335cm}}{\pgfqpoint{1.087cm}{1.3cm}}{\pgfqpoint{1.112cm}{1.274cm}}
\pgfpathcurveto{\pgfqpoint{1.138cm}{1.249cm}}{\pgfqpoint{1.172cm}{1.234cm}}{\pgfqpoint{1.209cm}{1.234cm}}
\pgfpathcurveto{\pgfqpoint{1.245cm}{1.234cm}}{\pgfqpoint{1.28cm}{1.249cm}}{\pgfqpoint{1.305cm}{1.274cm}}
\pgfpathcurveto{\pgfqpoint{1.331cm}{1.3cm}}{\pgfqpoint{1.345cm}{1.335cm}}{\pgfqpoint{1.345cm}{1.371cm}}
\pgfusepath{fill}
\begin{pgfscope}
\pgfsetdash{}{0cm}
\pgfsetlinewidth{0.818mm}
\pgfsetroundcap
\pgfsetmiterlimit{4.0}
\pgfpathmoveto{\pgfqpoint{0.682cm}{0.671cm}}
\pgfpathlineto{\pgfqpoint{0.682cm}{0.042cm}}
\pgfusepath{stroke}
\end{pgfscope}
\end{pgfscope}
\end{pgfscope}
\end{pgfscope}
\end{tikzpicture}}} + \phi + \psi)$, provided we can control the blow-up in time by a suitable power of the $\tau$ weight. Hence, in view of Lemma
  \ref{lemma:tau-weighted-schauder} (with $\beta_i = 0$), which we aim to apply in order to gain the required regularity of $\breve{\vartheta}$, we  estimate
  (provided $2 + \alpha \geq 1 + \nu$)
  \[ \| \tmop{com}_1 \|_{C \CC^{- 1 + \alpha} (\tau^{(2 + \alpha) / 2}
     \rho_{}^{3 + \gamma'})^{}} \]
  \[ \lesssim \left\| \left[ \tau^{\frac{1 + \nu}{2}} (- X^{\!\resizebox{0.6em}{!}{
\begin{tikzpicture}
\pgfpathmoveto{\pgfqpoint{0cm}{-0.035cm}}
\pgfpathlineto{\pgfqpoint{1.376cm}{-0.035cm}}
\pgfpathlineto{\pgfqpoint{1.376cm}{1.552cm}}
\pgfpathlineto{\pgfqpoint{0cm}{1.552cm}}
\pgfpathclose
\pgfusepath{clip}
\begin{pgfscope}
\begin{pgfscope}
\pgfpathmoveto{\pgfqpoint{0cm}{-0.035cm}}
\pgfpathlineto{\pgfqpoint{1.376cm}{-0.035cm}}
\pgfpathlineto{\pgfqpoint{1.376cm}{1.552cm}}
\pgfpathlineto{\pgfqpoint{0cm}{1.552cm}}
\pgfpathclose
\pgfusepath{clip}
\begin{pgfscope}
\begin{pgfscope}
\pgfsetdash{}{0cm}
\pgfsetlinewidth{0.818mm}
\pgfsetroundcap
\pgfsetroundjoin
\pgfsetmiterlimit{7.0}
\definecolor{eps2pgf_color}{gray}{0}\pgfsetstrokecolor{eps2pgf_color}\pgfsetfillcolor{eps2pgf_color}
\pgfpathmoveto{\pgfqpoint{0.117cm}{1.421cm}}
\pgfpathlineto{\pgfqpoint{0.682cm}{0.671cm}}
\pgfpathlineto{\pgfqpoint{1.246cm}{1.421cm}}
\pgfusepath{stroke}
\end{pgfscope}
\definecolor{eps2pgf_color}{gray}{0}\pgfsetstrokecolor{eps2pgf_color}\pgfsetfillcolor{eps2pgf_color}
\pgfpathmoveto{\pgfqpoint{0.273cm}{1.395cm}}
\pgfpathcurveto{\pgfqpoint{0.273cm}{1.432cm}}{\pgfqpoint{0.259cm}{1.467cm}}{\pgfqpoint{0.233cm}{1.492cm}}
\pgfpathcurveto{\pgfqpoint{0.207cm}{1.518cm}}{\pgfqpoint{0.173cm}{1.532cm}}{\pgfqpoint{0.137cm}{1.532cm}}
\pgfpathcurveto{\pgfqpoint{0.1cm}{1.532cm}}{\pgfqpoint{0.066cm}{1.518cm}}{\pgfqpoint{0.04cm}{1.492cm}}
\pgfpathcurveto{\pgfqpoint{0.014cm}{1.467cm}}{\pgfqpoint{0cm}{1.432cm}}{\pgfqpoint{0cm}{1.395cm}}
\pgfpathcurveto{\pgfqpoint{0cm}{1.359cm}}{\pgfqpoint{0.014cm}{1.324cm}}{\pgfqpoint{0.04cm}{1.299cm}}
\pgfpathcurveto{\pgfqpoint{0.066cm}{1.273cm}}{\pgfqpoint{0.1cm}{1.258cm}}{\pgfqpoint{0.137cm}{1.258cm}}
\pgfpathcurveto{\pgfqpoint{0.173cm}{1.258cm}}{\pgfqpoint{0.207cm}{1.273cm}}{\pgfqpoint{0.233cm}{1.299cm}}
\pgfpathcurveto{\pgfqpoint{0.259cm}{1.324cm}}{\pgfqpoint{0.273cm}{1.359cm}}{\pgfqpoint{0.273cm}{1.395cm}}
\pgfusepath{fill}
\begin{pgfscope}
\pgfsetdash{}{0cm}
\pgfsetlinewidth{0.818mm}
\pgfsetmiterlimit{7.0}
\pgfpathmoveto{\pgfqpoint{0.682cm}{0.671cm}}
\pgfpathlineto{\pgfqpoint{0.679cm}{1.418cm}}
\pgfusepath{stroke}
\end{pgfscope}
\pgfpathmoveto{\pgfqpoint{0.815cm}{1.399cm}}
\pgfpathcurveto{\pgfqpoint{0.815cm}{1.435cm}}{\pgfqpoint{0.801cm}{1.47cm}}{\pgfqpoint{0.775cm}{1.496cm}}
\pgfpathcurveto{\pgfqpoint{0.75cm}{1.521cm}}{\pgfqpoint{0.715cm}{1.536cm}}{\pgfqpoint{0.679cm}{1.536cm}}
\pgfpathcurveto{\pgfqpoint{0.643cm}{1.536cm}}{\pgfqpoint{0.608cm}{1.521cm}}{\pgfqpoint{0.582cm}{1.496cm}}
\pgfpathcurveto{\pgfqpoint{0.557cm}{1.47cm}}{\pgfqpoint{0.542cm}{1.435cm}}{\pgfqpoint{0.542cm}{1.399cm}}
\pgfpathcurveto{\pgfqpoint{0.542cm}{1.363cm}}{\pgfqpoint{0.557cm}{1.328cm}}{\pgfqpoint{0.582cm}{1.302cm}}
\pgfpathcurveto{\pgfqpoint{0.608cm}{1.276cm}}{\pgfqpoint{0.643cm}{1.262cm}}{\pgfqpoint{0.679cm}{1.262cm}}
\pgfpathcurveto{\pgfqpoint{0.715cm}{1.262cm}}{\pgfqpoint{0.75cm}{1.276cm}}{\pgfqpoint{0.775cm}{1.302cm}}
\pgfpathcurveto{\pgfqpoint{0.801cm}{1.328cm}}{\pgfqpoint{0.815cm}{1.363cm}}{\pgfqpoint{0.815cm}{1.399cm}}
\pgfusepath{fill}
\pgfpathmoveto{\pgfqpoint{1.345cm}{1.371cm}}
\pgfpathcurveto{\pgfqpoint{1.345cm}{1.408cm}}{\pgfqpoint{1.331cm}{1.442cm}}{\pgfqpoint{1.305cm}{1.468cm}}
\pgfpathcurveto{\pgfqpoint{1.28cm}{1.494cm}}{\pgfqpoint{1.245cm}{1.508cm}}{\pgfqpoint{1.209cm}{1.508cm}}
\pgfpathcurveto{\pgfqpoint{1.172cm}{1.508cm}}{\pgfqpoint{1.138cm}{1.494cm}}{\pgfqpoint{1.112cm}{1.468cm}}
\pgfpathcurveto{\pgfqpoint{1.087cm}{1.442cm}}{\pgfqpoint{1.072cm}{1.408cm}}{\pgfqpoint{1.072cm}{1.371cm}}
\pgfpathcurveto{\pgfqpoint{1.072cm}{1.335cm}}{\pgfqpoint{1.087cm}{1.3cm}}{\pgfqpoint{1.112cm}{1.274cm}}
\pgfpathcurveto{\pgfqpoint{1.138cm}{1.249cm}}{\pgfqpoint{1.172cm}{1.234cm}}{\pgfqpoint{1.209cm}{1.234cm}}
\pgfpathcurveto{\pgfqpoint{1.245cm}{1.234cm}}{\pgfqpoint{1.28cm}{1.249cm}}{\pgfqpoint{1.305cm}{1.274cm}}
\pgfpathcurveto{\pgfqpoint{1.331cm}{1.3cm}}{\pgfqpoint{1.345cm}{1.335cm}}{\pgfqpoint{1.345cm}{1.371cm}}
\pgfusepath{fill}
\begin{pgfscope}
\pgfsetdash{}{0cm}
\pgfsetlinewidth{0.818mm}
\pgfsetroundcap
\pgfsetmiterlimit{4.0}
\pgfpathmoveto{\pgfqpoint{0.682cm}{0.671cm}}
\pgfpathlineto{\pgfqpoint{0.682cm}{0.042cm}}
\pgfusepath{stroke}
\end{pgfscope}
\end{pgfscope}
\end{pgfscope}
\end{pgfscope}
\end{tikzpicture}}} + \phi
     + \psi) \right] \precprec \llbracket X^2\rrbracket- 3 \left[ \tau^{\frac{1 + \nu}{2}} (-
     X^{\!\resizebox{0.6em}{!}{
\begin{tikzpicture}
\pgfpathmoveto{\pgfqpoint{0cm}{-0.035cm}}
\pgfpathlineto{\pgfqpoint{1.376cm}{-0.035cm}}
\pgfpathlineto{\pgfqpoint{1.376cm}{1.552cm}}
\pgfpathlineto{\pgfqpoint{0cm}{1.552cm}}
\pgfpathclose
\pgfusepath{clip}
\begin{pgfscope}
\begin{pgfscope}
\pgfpathmoveto{\pgfqpoint{0cm}{-0.035cm}}
\pgfpathlineto{\pgfqpoint{1.376cm}{-0.035cm}}
\pgfpathlineto{\pgfqpoint{1.376cm}{1.552cm}}
\pgfpathlineto{\pgfqpoint{0cm}{1.552cm}}
\pgfpathclose
\pgfusepath{clip}
\begin{pgfscope}
\begin{pgfscope}
\pgfsetdash{}{0cm}
\pgfsetlinewidth{0.818mm}
\pgfsetroundcap
\pgfsetroundjoin
\pgfsetmiterlimit{7.0}
\definecolor{eps2pgf_color}{gray}{0}\pgfsetstrokecolor{eps2pgf_color}\pgfsetfillcolor{eps2pgf_color}
\pgfpathmoveto{\pgfqpoint{0.117cm}{1.421cm}}
\pgfpathlineto{\pgfqpoint{0.682cm}{0.671cm}}
\pgfpathlineto{\pgfqpoint{1.246cm}{1.421cm}}
\pgfusepath{stroke}
\end{pgfscope}
\definecolor{eps2pgf_color}{gray}{0}\pgfsetstrokecolor{eps2pgf_color}\pgfsetfillcolor{eps2pgf_color}
\pgfpathmoveto{\pgfqpoint{0.273cm}{1.395cm}}
\pgfpathcurveto{\pgfqpoint{0.273cm}{1.432cm}}{\pgfqpoint{0.259cm}{1.467cm}}{\pgfqpoint{0.233cm}{1.492cm}}
\pgfpathcurveto{\pgfqpoint{0.207cm}{1.518cm}}{\pgfqpoint{0.173cm}{1.532cm}}{\pgfqpoint{0.137cm}{1.532cm}}
\pgfpathcurveto{\pgfqpoint{0.1cm}{1.532cm}}{\pgfqpoint{0.066cm}{1.518cm}}{\pgfqpoint{0.04cm}{1.492cm}}
\pgfpathcurveto{\pgfqpoint{0.014cm}{1.467cm}}{\pgfqpoint{0cm}{1.432cm}}{\pgfqpoint{0cm}{1.395cm}}
\pgfpathcurveto{\pgfqpoint{0cm}{1.359cm}}{\pgfqpoint{0.014cm}{1.324cm}}{\pgfqpoint{0.04cm}{1.299cm}}
\pgfpathcurveto{\pgfqpoint{0.066cm}{1.273cm}}{\pgfqpoint{0.1cm}{1.258cm}}{\pgfqpoint{0.137cm}{1.258cm}}
\pgfpathcurveto{\pgfqpoint{0.173cm}{1.258cm}}{\pgfqpoint{0.207cm}{1.273cm}}{\pgfqpoint{0.233cm}{1.299cm}}
\pgfpathcurveto{\pgfqpoint{0.259cm}{1.324cm}}{\pgfqpoint{0.273cm}{1.359cm}}{\pgfqpoint{0.273cm}{1.395cm}}
\pgfusepath{fill}
\begin{pgfscope}
\pgfsetdash{}{0cm}
\pgfsetlinewidth{0.818mm}
\pgfsetmiterlimit{7.0}
\pgfpathmoveto{\pgfqpoint{0.682cm}{0.671cm}}
\pgfpathlineto{\pgfqpoint{0.679cm}{1.418cm}}
\pgfusepath{stroke}
\end{pgfscope}
\pgfpathmoveto{\pgfqpoint{0.815cm}{1.399cm}}
\pgfpathcurveto{\pgfqpoint{0.815cm}{1.435cm}}{\pgfqpoint{0.801cm}{1.47cm}}{\pgfqpoint{0.775cm}{1.496cm}}
\pgfpathcurveto{\pgfqpoint{0.75cm}{1.521cm}}{\pgfqpoint{0.715cm}{1.536cm}}{\pgfqpoint{0.679cm}{1.536cm}}
\pgfpathcurveto{\pgfqpoint{0.643cm}{1.536cm}}{\pgfqpoint{0.608cm}{1.521cm}}{\pgfqpoint{0.582cm}{1.496cm}}
\pgfpathcurveto{\pgfqpoint{0.557cm}{1.47cm}}{\pgfqpoint{0.542cm}{1.435cm}}{\pgfqpoint{0.542cm}{1.399cm}}
\pgfpathcurveto{\pgfqpoint{0.542cm}{1.363cm}}{\pgfqpoint{0.557cm}{1.328cm}}{\pgfqpoint{0.582cm}{1.302cm}}
\pgfpathcurveto{\pgfqpoint{0.608cm}{1.276cm}}{\pgfqpoint{0.643cm}{1.262cm}}{\pgfqpoint{0.679cm}{1.262cm}}
\pgfpathcurveto{\pgfqpoint{0.715cm}{1.262cm}}{\pgfqpoint{0.75cm}{1.276cm}}{\pgfqpoint{0.775cm}{1.302cm}}
\pgfpathcurveto{\pgfqpoint{0.801cm}{1.328cm}}{\pgfqpoint{0.815cm}{1.363cm}}{\pgfqpoint{0.815cm}{1.399cm}}
\pgfusepath{fill}
\pgfpathmoveto{\pgfqpoint{1.345cm}{1.371cm}}
\pgfpathcurveto{\pgfqpoint{1.345cm}{1.408cm}}{\pgfqpoint{1.331cm}{1.442cm}}{\pgfqpoint{1.305cm}{1.468cm}}
\pgfpathcurveto{\pgfqpoint{1.28cm}{1.494cm}}{\pgfqpoint{1.245cm}{1.508cm}}{\pgfqpoint{1.209cm}{1.508cm}}
\pgfpathcurveto{\pgfqpoint{1.172cm}{1.508cm}}{\pgfqpoint{1.138cm}{1.494cm}}{\pgfqpoint{1.112cm}{1.468cm}}
\pgfpathcurveto{\pgfqpoint{1.087cm}{1.442cm}}{\pgfqpoint{1.072cm}{1.408cm}}{\pgfqpoint{1.072cm}{1.371cm}}
\pgfpathcurveto{\pgfqpoint{1.072cm}{1.335cm}}{\pgfqpoint{1.087cm}{1.3cm}}{\pgfqpoint{1.112cm}{1.274cm}}
\pgfpathcurveto{\pgfqpoint{1.138cm}{1.249cm}}{\pgfqpoint{1.172cm}{1.234cm}}{\pgfqpoint{1.209cm}{1.234cm}}
\pgfpathcurveto{\pgfqpoint{1.245cm}{1.234cm}}{\pgfqpoint{1.28cm}{1.249cm}}{\pgfqpoint{1.305cm}{1.274cm}}
\pgfpathcurveto{\pgfqpoint{1.331cm}{1.3cm}}{\pgfqpoint{1.345cm}{1.335cm}}{\pgfqpoint{1.345cm}{1.371cm}}
\pgfusepath{fill}
\begin{pgfscope}
\pgfsetdash{}{0cm}
\pgfsetlinewidth{0.818mm}
\pgfsetroundcap
\pgfsetmiterlimit{4.0}
\pgfpathmoveto{\pgfqpoint{0.682cm}{0.671cm}}
\pgfpathlineto{\pgfqpoint{0.682cm}{0.042cm}}
\pgfusepath{stroke}
\end{pgfscope}
\end{pgfscope}
\end{pgfscope}
\end{pgfscope}
\end{tikzpicture}}} + \phi + \psi) \right] \prec\llbracket X^2\rrbracket \right\|_{C \CC^{- 1 +
     \alpha} (\rho_{}^{3 + \gamma'})^{}} \]
  \[ \lesssim 1 + \left\| \tau^{\frac{1 + \nu}{2}} (\phi + \psi)
     \right\|_{C^{(\alpha + \kappa) / 2} L^{\infty} (\rho^{3 + \gamma''})} \]
  for some $0 < \gamma'' < \gamma'$. For the second commutator, it holds
  \[ \tmop{com}_2= - \frac{1 + \nu}{2} \tau^{- \frac{1 + \nu}{2} - 1}
     (1 - \tau) \left[ \left( 3 \tau^{\frac{1 + \nu}{2}} (- X^{\!\resizebox{0.6em}{!}{
\begin{tikzpicture}
\pgfpathmoveto{\pgfqpoint{0cm}{-0.035cm}}
\pgfpathlineto{\pgfqpoint{1.376cm}{-0.035cm}}
\pgfpathlineto{\pgfqpoint{1.376cm}{1.552cm}}
\pgfpathlineto{\pgfqpoint{0cm}{1.552cm}}
\pgfpathclose
\pgfusepath{clip}
\begin{pgfscope}
\begin{pgfscope}
\pgfpathmoveto{\pgfqpoint{0cm}{-0.035cm}}
\pgfpathlineto{\pgfqpoint{1.376cm}{-0.035cm}}
\pgfpathlineto{\pgfqpoint{1.376cm}{1.552cm}}
\pgfpathlineto{\pgfqpoint{0cm}{1.552cm}}
\pgfpathclose
\pgfusepath{clip}
\begin{pgfscope}
\begin{pgfscope}
\pgfsetdash{}{0cm}
\pgfsetlinewidth{0.818mm}
\pgfsetroundcap
\pgfsetroundjoin
\pgfsetmiterlimit{7.0}
\definecolor{eps2pgf_color}{gray}{0}\pgfsetstrokecolor{eps2pgf_color}\pgfsetfillcolor{eps2pgf_color}
\pgfpathmoveto{\pgfqpoint{0.117cm}{1.421cm}}
\pgfpathlineto{\pgfqpoint{0.682cm}{0.671cm}}
\pgfpathlineto{\pgfqpoint{1.246cm}{1.421cm}}
\pgfusepath{stroke}
\end{pgfscope}
\definecolor{eps2pgf_color}{gray}{0}\pgfsetstrokecolor{eps2pgf_color}\pgfsetfillcolor{eps2pgf_color}
\pgfpathmoveto{\pgfqpoint{0.273cm}{1.395cm}}
\pgfpathcurveto{\pgfqpoint{0.273cm}{1.432cm}}{\pgfqpoint{0.259cm}{1.467cm}}{\pgfqpoint{0.233cm}{1.492cm}}
\pgfpathcurveto{\pgfqpoint{0.207cm}{1.518cm}}{\pgfqpoint{0.173cm}{1.532cm}}{\pgfqpoint{0.137cm}{1.532cm}}
\pgfpathcurveto{\pgfqpoint{0.1cm}{1.532cm}}{\pgfqpoint{0.066cm}{1.518cm}}{\pgfqpoint{0.04cm}{1.492cm}}
\pgfpathcurveto{\pgfqpoint{0.014cm}{1.467cm}}{\pgfqpoint{0cm}{1.432cm}}{\pgfqpoint{0cm}{1.395cm}}
\pgfpathcurveto{\pgfqpoint{0cm}{1.359cm}}{\pgfqpoint{0.014cm}{1.324cm}}{\pgfqpoint{0.04cm}{1.299cm}}
\pgfpathcurveto{\pgfqpoint{0.066cm}{1.273cm}}{\pgfqpoint{0.1cm}{1.258cm}}{\pgfqpoint{0.137cm}{1.258cm}}
\pgfpathcurveto{\pgfqpoint{0.173cm}{1.258cm}}{\pgfqpoint{0.207cm}{1.273cm}}{\pgfqpoint{0.233cm}{1.299cm}}
\pgfpathcurveto{\pgfqpoint{0.259cm}{1.324cm}}{\pgfqpoint{0.273cm}{1.359cm}}{\pgfqpoint{0.273cm}{1.395cm}}
\pgfusepath{fill}
\begin{pgfscope}
\pgfsetdash{}{0cm}
\pgfsetlinewidth{0.818mm}
\pgfsetmiterlimit{7.0}
\pgfpathmoveto{\pgfqpoint{0.682cm}{0.671cm}}
\pgfpathlineto{\pgfqpoint{0.679cm}{1.418cm}}
\pgfusepath{stroke}
\end{pgfscope}
\pgfpathmoveto{\pgfqpoint{0.815cm}{1.399cm}}
\pgfpathcurveto{\pgfqpoint{0.815cm}{1.435cm}}{\pgfqpoint{0.801cm}{1.47cm}}{\pgfqpoint{0.775cm}{1.496cm}}
\pgfpathcurveto{\pgfqpoint{0.75cm}{1.521cm}}{\pgfqpoint{0.715cm}{1.536cm}}{\pgfqpoint{0.679cm}{1.536cm}}
\pgfpathcurveto{\pgfqpoint{0.643cm}{1.536cm}}{\pgfqpoint{0.608cm}{1.521cm}}{\pgfqpoint{0.582cm}{1.496cm}}
\pgfpathcurveto{\pgfqpoint{0.557cm}{1.47cm}}{\pgfqpoint{0.542cm}{1.435cm}}{\pgfqpoint{0.542cm}{1.399cm}}
\pgfpathcurveto{\pgfqpoint{0.542cm}{1.363cm}}{\pgfqpoint{0.557cm}{1.328cm}}{\pgfqpoint{0.582cm}{1.302cm}}
\pgfpathcurveto{\pgfqpoint{0.608cm}{1.276cm}}{\pgfqpoint{0.643cm}{1.262cm}}{\pgfqpoint{0.679cm}{1.262cm}}
\pgfpathcurveto{\pgfqpoint{0.715cm}{1.262cm}}{\pgfqpoint{0.75cm}{1.276cm}}{\pgfqpoint{0.775cm}{1.302cm}}
\pgfpathcurveto{\pgfqpoint{0.801cm}{1.328cm}}{\pgfqpoint{0.815cm}{1.363cm}}{\pgfqpoint{0.815cm}{1.399cm}}
\pgfusepath{fill}
\pgfpathmoveto{\pgfqpoint{1.345cm}{1.371cm}}
\pgfpathcurveto{\pgfqpoint{1.345cm}{1.408cm}}{\pgfqpoint{1.331cm}{1.442cm}}{\pgfqpoint{1.305cm}{1.468cm}}
\pgfpathcurveto{\pgfqpoint{1.28cm}{1.494cm}}{\pgfqpoint{1.245cm}{1.508cm}}{\pgfqpoint{1.209cm}{1.508cm}}
\pgfpathcurveto{\pgfqpoint{1.172cm}{1.508cm}}{\pgfqpoint{1.138cm}{1.494cm}}{\pgfqpoint{1.112cm}{1.468cm}}
\pgfpathcurveto{\pgfqpoint{1.087cm}{1.442cm}}{\pgfqpoint{1.072cm}{1.408cm}}{\pgfqpoint{1.072cm}{1.371cm}}
\pgfpathcurveto{\pgfqpoint{1.072cm}{1.335cm}}{\pgfqpoint{1.087cm}{1.3cm}}{\pgfqpoint{1.112cm}{1.274cm}}
\pgfpathcurveto{\pgfqpoint{1.138cm}{1.249cm}}{\pgfqpoint{1.172cm}{1.234cm}}{\pgfqpoint{1.209cm}{1.234cm}}
\pgfpathcurveto{\pgfqpoint{1.245cm}{1.234cm}}{\pgfqpoint{1.28cm}{1.249cm}}{\pgfqpoint{1.305cm}{1.274cm}}
\pgfpathcurveto{\pgfqpoint{1.331cm}{1.3cm}}{\pgfqpoint{1.345cm}{1.335cm}}{\pgfqpoint{1.345cm}{1.371cm}}
\pgfusepath{fill}
\begin{pgfscope}
\pgfsetdash{}{0cm}
\pgfsetlinewidth{0.818mm}
\pgfsetroundcap
\pgfsetmiterlimit{4.0}
\pgfpathmoveto{\pgfqpoint{0.682cm}{0.671cm}}
\pgfpathlineto{\pgfqpoint{0.682cm}{0.042cm}}
\pgfusepath{stroke}
\end{pgfscope}
\end{pgfscope}
\end{pgfscope}
\end{pgfscope}
\end{tikzpicture}}} +
     \phi + \psi) \right) \precprec X^{\!\resizebox{0.6em}{!}{
\begin{tikzpicture}
\pgfpathmoveto{\pgfqpoint{0cm}{0cm}}
\pgfpathlineto{\pgfqpoint{1.376cm}{0cm}}
\pgfpathlineto{\pgfqpoint{1.376cm}{1.588cm}}
\pgfpathlineto{\pgfqpoint{0cm}{1.588cm}}
\pgfpathclose
\pgfusepath{clip}
\begin{pgfscope}
\begin{pgfscope}
\pgfpathmoveto{\pgfqpoint{0cm}{0cm}}
\pgfpathlineto{\pgfqpoint{1.376cm}{0cm}}
\pgfpathlineto{\pgfqpoint{1.376cm}{1.588cm}}
\pgfpathlineto{\pgfqpoint{0cm}{1.588cm}}
\pgfpathclose
\pgfusepath{clip}
\begin{pgfscope}
\begin{pgfscope}
\definecolor{eps2pgf_color}{gray}{0.976471}\pgfsetstrokecolor{eps2pgf_color}\pgfsetfillcolor{eps2pgf_color}
\pgfpathmoveto{\pgfqpoint{0cm}{0cm}}
\pgfpathlineto{\pgfqpoint{1.376cm}{0cm}}
\pgfpathlineto{\pgfqpoint{1.376cm}{1.588cm}}
\pgfpathlineto{\pgfqpoint{0cm}{1.588cm}}
\pgfpathclose
\pgfusepath{fill}
\end{pgfscope}
\begin{pgfscope}
\pgfsetdash{}{0cm}
\pgfsetlinewidth{0.818mm}
\pgfsetroundcap
\pgfsetroundjoin
\pgfsetmiterlimit{7.0}
\definecolor{eps2pgf_color}{gray}{0}\pgfsetstrokecolor{eps2pgf_color}\pgfsetfillcolor{eps2pgf_color}
\pgfpathmoveto{\pgfqpoint{0.117cm}{1.476cm}}
\pgfpathlineto{\pgfqpoint{0.682cm}{0.726cm}}
\pgfpathlineto{\pgfqpoint{1.246cm}{1.476cm}}
\pgfusepath{stroke}
\end{pgfscope}
\definecolor{eps2pgf_color}{gray}{0}\pgfsetstrokecolor{eps2pgf_color}\pgfsetfillcolor{eps2pgf_color}
\pgfpathmoveto{\pgfqpoint{0.273cm}{1.451cm}}
\pgfpathcurveto{\pgfqpoint{0.273cm}{1.487cm}}{\pgfqpoint{0.259cm}{1.522cm}}{\pgfqpoint{0.233cm}{1.547cm}}
\pgfpathcurveto{\pgfqpoint{0.207cm}{1.573cm}}{\pgfqpoint{0.173cm}{1.588cm}}{\pgfqpoint{0.137cm}{1.588cm}}
\pgfpathcurveto{\pgfqpoint{0.1cm}{1.588cm}}{\pgfqpoint{0.066cm}{1.573cm}}{\pgfqpoint{0.04cm}{1.547cm}}
\pgfpathcurveto{\pgfqpoint{0.014cm}{1.522cm}}{\pgfqpoint{0cm}{1.487cm}}{\pgfqpoint{0cm}{1.451cm}}
\pgfpathcurveto{\pgfqpoint{0cm}{1.414cm}}{\pgfqpoint{0.014cm}{1.379cm}}{\pgfqpoint{0.04cm}{1.354cm}}
\pgfpathcurveto{\pgfqpoint{0.066cm}{1.328cm}}{\pgfqpoint{0.1cm}{1.314cm}}{\pgfqpoint{0.137cm}{1.314cm}}
\pgfpathcurveto{\pgfqpoint{0.173cm}{1.314cm}}{\pgfqpoint{0.207cm}{1.328cm}}{\pgfqpoint{0.233cm}{1.354cm}}
\pgfpathcurveto{\pgfqpoint{0.259cm}{1.379cm}}{\pgfqpoint{0.273cm}{1.414cm}}{\pgfqpoint{0.273cm}{1.451cm}}
\pgfusepath{fill}
\pgfpathmoveto{\pgfqpoint{1.345cm}{1.426cm}}
\pgfpathcurveto{\pgfqpoint{1.345cm}{1.463cm}}{\pgfqpoint{1.331cm}{1.497cm}}{\pgfqpoint{1.305cm}{1.523cm}}
\pgfpathcurveto{\pgfqpoint{1.28cm}{1.549cm}}{\pgfqpoint{1.245cm}{1.563cm}}{\pgfqpoint{1.209cm}{1.563cm}}
\pgfpathcurveto{\pgfqpoint{1.172cm}{1.563cm}}{\pgfqpoint{1.138cm}{1.549cm}}{\pgfqpoint{1.112cm}{1.523cm}}
\pgfpathcurveto{\pgfqpoint{1.087cm}{1.497cm}}{\pgfqpoint{1.072cm}{1.463cm}}{\pgfqpoint{1.072cm}{1.426cm}}
\pgfpathcurveto{\pgfqpoint{1.072cm}{1.39cm}}{\pgfqpoint{1.087cm}{1.355cm}}{\pgfqpoint{1.112cm}{1.329cm}}
\pgfpathcurveto{\pgfqpoint{1.138cm}{1.304cm}}{\pgfqpoint{1.172cm}{1.289cm}}{\pgfqpoint{1.209cm}{1.289cm}}
\pgfpathcurveto{\pgfqpoint{1.245cm}{1.289cm}}{\pgfqpoint{1.28cm}{1.304cm}}{\pgfqpoint{1.305cm}{1.329cm}}
\pgfpathcurveto{\pgfqpoint{1.331cm}{1.355cm}}{\pgfqpoint{1.345cm}{1.39cm}}{\pgfqpoint{1.345cm}{1.426cm}}
\pgfusepath{fill}
\begin{pgfscope}
\pgfsetdash{}{0cm}
\pgfsetlinewidth{0.818mm}
\pgfsetroundcap
\pgfsetmiterlimit{4.0}
\pgfpathmoveto{\pgfqpoint{0.682cm}{0.726cm}}
\pgfpathlineto{\pgfqpoint{0.682cm}{0.097cm}}
\pgfusepath{stroke}
\end{pgfscope}
\end{pgfscope}
\end{pgfscope}
\end{pgfscope}
\end{tikzpicture}}} \right] \]
  \[ + \tau^{- \frac{1 + \nu}{2}} \left[ \LL, \left( 3 \tau^{\frac{1 +
     \nu}{2}} (- X^{\!\resizebox{0.6em}{!}{
\begin{tikzpicture}
\pgfpathmoveto{\pgfqpoint{0cm}{-0.035cm}}
\pgfpathlineto{\pgfqpoint{1.376cm}{-0.035cm}}
\pgfpathlineto{\pgfqpoint{1.376cm}{1.552cm}}
\pgfpathlineto{\pgfqpoint{0cm}{1.552cm}}
\pgfpathclose
\pgfusepath{clip}
\begin{pgfscope}
\begin{pgfscope}
\pgfpathmoveto{\pgfqpoint{0cm}{-0.035cm}}
\pgfpathlineto{\pgfqpoint{1.376cm}{-0.035cm}}
\pgfpathlineto{\pgfqpoint{1.376cm}{1.552cm}}
\pgfpathlineto{\pgfqpoint{0cm}{1.552cm}}
\pgfpathclose
\pgfusepath{clip}
\begin{pgfscope}
\begin{pgfscope}
\pgfsetdash{}{0cm}
\pgfsetlinewidth{0.818mm}
\pgfsetroundcap
\pgfsetroundjoin
\pgfsetmiterlimit{7.0}
\definecolor{eps2pgf_color}{gray}{0}\pgfsetstrokecolor{eps2pgf_color}\pgfsetfillcolor{eps2pgf_color}
\pgfpathmoveto{\pgfqpoint{0.117cm}{1.421cm}}
\pgfpathlineto{\pgfqpoint{0.682cm}{0.671cm}}
\pgfpathlineto{\pgfqpoint{1.246cm}{1.421cm}}
\pgfusepath{stroke}
\end{pgfscope}
\definecolor{eps2pgf_color}{gray}{0}\pgfsetstrokecolor{eps2pgf_color}\pgfsetfillcolor{eps2pgf_color}
\pgfpathmoveto{\pgfqpoint{0.273cm}{1.395cm}}
\pgfpathcurveto{\pgfqpoint{0.273cm}{1.432cm}}{\pgfqpoint{0.259cm}{1.467cm}}{\pgfqpoint{0.233cm}{1.492cm}}
\pgfpathcurveto{\pgfqpoint{0.207cm}{1.518cm}}{\pgfqpoint{0.173cm}{1.532cm}}{\pgfqpoint{0.137cm}{1.532cm}}
\pgfpathcurveto{\pgfqpoint{0.1cm}{1.532cm}}{\pgfqpoint{0.066cm}{1.518cm}}{\pgfqpoint{0.04cm}{1.492cm}}
\pgfpathcurveto{\pgfqpoint{0.014cm}{1.467cm}}{\pgfqpoint{0cm}{1.432cm}}{\pgfqpoint{0cm}{1.395cm}}
\pgfpathcurveto{\pgfqpoint{0cm}{1.359cm}}{\pgfqpoint{0.014cm}{1.324cm}}{\pgfqpoint{0.04cm}{1.299cm}}
\pgfpathcurveto{\pgfqpoint{0.066cm}{1.273cm}}{\pgfqpoint{0.1cm}{1.258cm}}{\pgfqpoint{0.137cm}{1.258cm}}
\pgfpathcurveto{\pgfqpoint{0.173cm}{1.258cm}}{\pgfqpoint{0.207cm}{1.273cm}}{\pgfqpoint{0.233cm}{1.299cm}}
\pgfpathcurveto{\pgfqpoint{0.259cm}{1.324cm}}{\pgfqpoint{0.273cm}{1.359cm}}{\pgfqpoint{0.273cm}{1.395cm}}
\pgfusepath{fill}
\begin{pgfscope}
\pgfsetdash{}{0cm}
\pgfsetlinewidth{0.818mm}
\pgfsetmiterlimit{7.0}
\pgfpathmoveto{\pgfqpoint{0.682cm}{0.671cm}}
\pgfpathlineto{\pgfqpoint{0.679cm}{1.418cm}}
\pgfusepath{stroke}
\end{pgfscope}
\pgfpathmoveto{\pgfqpoint{0.815cm}{1.399cm}}
\pgfpathcurveto{\pgfqpoint{0.815cm}{1.435cm}}{\pgfqpoint{0.801cm}{1.47cm}}{\pgfqpoint{0.775cm}{1.496cm}}
\pgfpathcurveto{\pgfqpoint{0.75cm}{1.521cm}}{\pgfqpoint{0.715cm}{1.536cm}}{\pgfqpoint{0.679cm}{1.536cm}}
\pgfpathcurveto{\pgfqpoint{0.643cm}{1.536cm}}{\pgfqpoint{0.608cm}{1.521cm}}{\pgfqpoint{0.582cm}{1.496cm}}
\pgfpathcurveto{\pgfqpoint{0.557cm}{1.47cm}}{\pgfqpoint{0.542cm}{1.435cm}}{\pgfqpoint{0.542cm}{1.399cm}}
\pgfpathcurveto{\pgfqpoint{0.542cm}{1.363cm}}{\pgfqpoint{0.557cm}{1.328cm}}{\pgfqpoint{0.582cm}{1.302cm}}
\pgfpathcurveto{\pgfqpoint{0.608cm}{1.276cm}}{\pgfqpoint{0.643cm}{1.262cm}}{\pgfqpoint{0.679cm}{1.262cm}}
\pgfpathcurveto{\pgfqpoint{0.715cm}{1.262cm}}{\pgfqpoint{0.75cm}{1.276cm}}{\pgfqpoint{0.775cm}{1.302cm}}
\pgfpathcurveto{\pgfqpoint{0.801cm}{1.328cm}}{\pgfqpoint{0.815cm}{1.363cm}}{\pgfqpoint{0.815cm}{1.399cm}}
\pgfusepath{fill}
\pgfpathmoveto{\pgfqpoint{1.345cm}{1.371cm}}
\pgfpathcurveto{\pgfqpoint{1.345cm}{1.408cm}}{\pgfqpoint{1.331cm}{1.442cm}}{\pgfqpoint{1.305cm}{1.468cm}}
\pgfpathcurveto{\pgfqpoint{1.28cm}{1.494cm}}{\pgfqpoint{1.245cm}{1.508cm}}{\pgfqpoint{1.209cm}{1.508cm}}
\pgfpathcurveto{\pgfqpoint{1.172cm}{1.508cm}}{\pgfqpoint{1.138cm}{1.494cm}}{\pgfqpoint{1.112cm}{1.468cm}}
\pgfpathcurveto{\pgfqpoint{1.087cm}{1.442cm}}{\pgfqpoint{1.072cm}{1.408cm}}{\pgfqpoint{1.072cm}{1.371cm}}
\pgfpathcurveto{\pgfqpoint{1.072cm}{1.335cm}}{\pgfqpoint{1.087cm}{1.3cm}}{\pgfqpoint{1.112cm}{1.274cm}}
\pgfpathcurveto{\pgfqpoint{1.138cm}{1.249cm}}{\pgfqpoint{1.172cm}{1.234cm}}{\pgfqpoint{1.209cm}{1.234cm}}
\pgfpathcurveto{\pgfqpoint{1.245cm}{1.234cm}}{\pgfqpoint{1.28cm}{1.249cm}}{\pgfqpoint{1.305cm}{1.274cm}}
\pgfpathcurveto{\pgfqpoint{1.331cm}{1.3cm}}{\pgfqpoint{1.345cm}{1.335cm}}{\pgfqpoint{1.345cm}{1.371cm}}
\pgfusepath{fill}
\begin{pgfscope}
\pgfsetdash{}{0cm}
\pgfsetlinewidth{0.818mm}
\pgfsetroundcap
\pgfsetmiterlimit{4.0}
\pgfpathmoveto{\pgfqpoint{0.682cm}{0.671cm}}
\pgfpathlineto{\pgfqpoint{0.682cm}{0.042cm}}
\pgfusepath{stroke}
\end{pgfscope}
\end{pgfscope}
\end{pgfscope}
\end{pgfscope}
\end{tikzpicture}}} + \phi + \psi) \right) \precprec \right]
     X^{\!\resizebox{0.6em}{!}{
\begin{tikzpicture}
\pgfpathmoveto{\pgfqpoint{0cm}{0cm}}
\pgfpathlineto{\pgfqpoint{1.376cm}{0cm}}
\pgfpathlineto{\pgfqpoint{1.376cm}{1.588cm}}
\pgfpathlineto{\pgfqpoint{0cm}{1.588cm}}
\pgfpathclose
\pgfusepath{clip}
\begin{pgfscope}
\begin{pgfscope}
\pgfpathmoveto{\pgfqpoint{0cm}{0cm}}
\pgfpathlineto{\pgfqpoint{1.376cm}{0cm}}
\pgfpathlineto{\pgfqpoint{1.376cm}{1.588cm}}
\pgfpathlineto{\pgfqpoint{0cm}{1.588cm}}
\pgfpathclose
\pgfusepath{clip}
\begin{pgfscope}
\begin{pgfscope}
\definecolor{eps2pgf_color}{gray}{0.976471}\pgfsetstrokecolor{eps2pgf_color}\pgfsetfillcolor{eps2pgf_color}
\pgfpathmoveto{\pgfqpoint{0cm}{0cm}}
\pgfpathlineto{\pgfqpoint{1.376cm}{0cm}}
\pgfpathlineto{\pgfqpoint{1.376cm}{1.588cm}}
\pgfpathlineto{\pgfqpoint{0cm}{1.588cm}}
\pgfpathclose
\pgfusepath{fill}
\end{pgfscope}
\begin{pgfscope}
\pgfsetdash{}{0cm}
\pgfsetlinewidth{0.818mm}
\pgfsetroundcap
\pgfsetroundjoin
\pgfsetmiterlimit{7.0}
\definecolor{eps2pgf_color}{gray}{0}\pgfsetstrokecolor{eps2pgf_color}\pgfsetfillcolor{eps2pgf_color}
\pgfpathmoveto{\pgfqpoint{0.117cm}{1.476cm}}
\pgfpathlineto{\pgfqpoint{0.682cm}{0.726cm}}
\pgfpathlineto{\pgfqpoint{1.246cm}{1.476cm}}
\pgfusepath{stroke}
\end{pgfscope}
\definecolor{eps2pgf_color}{gray}{0}\pgfsetstrokecolor{eps2pgf_color}\pgfsetfillcolor{eps2pgf_color}
\pgfpathmoveto{\pgfqpoint{0.273cm}{1.451cm}}
\pgfpathcurveto{\pgfqpoint{0.273cm}{1.487cm}}{\pgfqpoint{0.259cm}{1.522cm}}{\pgfqpoint{0.233cm}{1.547cm}}
\pgfpathcurveto{\pgfqpoint{0.207cm}{1.573cm}}{\pgfqpoint{0.173cm}{1.588cm}}{\pgfqpoint{0.137cm}{1.588cm}}
\pgfpathcurveto{\pgfqpoint{0.1cm}{1.588cm}}{\pgfqpoint{0.066cm}{1.573cm}}{\pgfqpoint{0.04cm}{1.547cm}}
\pgfpathcurveto{\pgfqpoint{0.014cm}{1.522cm}}{\pgfqpoint{0cm}{1.487cm}}{\pgfqpoint{0cm}{1.451cm}}
\pgfpathcurveto{\pgfqpoint{0cm}{1.414cm}}{\pgfqpoint{0.014cm}{1.379cm}}{\pgfqpoint{0.04cm}{1.354cm}}
\pgfpathcurveto{\pgfqpoint{0.066cm}{1.328cm}}{\pgfqpoint{0.1cm}{1.314cm}}{\pgfqpoint{0.137cm}{1.314cm}}
\pgfpathcurveto{\pgfqpoint{0.173cm}{1.314cm}}{\pgfqpoint{0.207cm}{1.328cm}}{\pgfqpoint{0.233cm}{1.354cm}}
\pgfpathcurveto{\pgfqpoint{0.259cm}{1.379cm}}{\pgfqpoint{0.273cm}{1.414cm}}{\pgfqpoint{0.273cm}{1.451cm}}
\pgfusepath{fill}
\pgfpathmoveto{\pgfqpoint{1.345cm}{1.426cm}}
\pgfpathcurveto{\pgfqpoint{1.345cm}{1.463cm}}{\pgfqpoint{1.331cm}{1.497cm}}{\pgfqpoint{1.305cm}{1.523cm}}
\pgfpathcurveto{\pgfqpoint{1.28cm}{1.549cm}}{\pgfqpoint{1.245cm}{1.563cm}}{\pgfqpoint{1.209cm}{1.563cm}}
\pgfpathcurveto{\pgfqpoint{1.172cm}{1.563cm}}{\pgfqpoint{1.138cm}{1.549cm}}{\pgfqpoint{1.112cm}{1.523cm}}
\pgfpathcurveto{\pgfqpoint{1.087cm}{1.497cm}}{\pgfqpoint{1.072cm}{1.463cm}}{\pgfqpoint{1.072cm}{1.426cm}}
\pgfpathcurveto{\pgfqpoint{1.072cm}{1.39cm}}{\pgfqpoint{1.087cm}{1.355cm}}{\pgfqpoint{1.112cm}{1.329cm}}
\pgfpathcurveto{\pgfqpoint{1.138cm}{1.304cm}}{\pgfqpoint{1.172cm}{1.289cm}}{\pgfqpoint{1.209cm}{1.289cm}}
\pgfpathcurveto{\pgfqpoint{1.245cm}{1.289cm}}{\pgfqpoint{1.28cm}{1.304cm}}{\pgfqpoint{1.305cm}{1.329cm}}
\pgfpathcurveto{\pgfqpoint{1.331cm}{1.355cm}}{\pgfqpoint{1.345cm}{1.39cm}}{\pgfqpoint{1.345cm}{1.426cm}}
\pgfusepath{fill}
\begin{pgfscope}
\pgfsetdash{}{0cm}
\pgfsetlinewidth{0.818mm}
\pgfsetroundcap
\pgfsetmiterlimit{4.0}
\pgfpathmoveto{\pgfqpoint{0.682cm}{0.726cm}}
\pgfpathlineto{\pgfqpoint{0.682cm}{0.097cm}}
\pgfusepath{stroke}
\end{pgfscope}
\end{pgfscope}
\end{pgfscope}
\end{pgfscope}
\end{tikzpicture}}} \backassign \tmop{com}_{21} + \tmop{com}_{22} . \]
  The first term can be estimated in $\CC^{1 - \kappa}$ with a suitable
  weight. Hence Lemma \ref{lemma:tau-weighted-schauder} allows us to
  compensate for the blow up in $\tau$. More precisely, for this term we apply
  Lemma \ref{lemma:tau-weighted-schauder} with $\beta_i = 2 - \kappa - \alpha$
  to obtain (provided $4 - \kappa \geq 3 + \nu$)
  \[ \| \tau^{(2 - \kappa - \alpha) / 2} \tmop{com}_{21} \|_{C \CC^{1 -
     \kappa} (\tau^{(2 + \alpha) / 2} \rho_{}^{3 + \gamma'})^{}} \]
  \[ \lesssim \left\| \left( \tau^{\frac{1 + \nu}{2}} (- X^{\!\resizebox{0.6em}{!}{
\begin{tikzpicture}
\pgfpathmoveto{\pgfqpoint{0cm}{-0.035cm}}
\pgfpathlineto{\pgfqpoint{1.376cm}{-0.035cm}}
\pgfpathlineto{\pgfqpoint{1.376cm}{1.552cm}}
\pgfpathlineto{\pgfqpoint{0cm}{1.552cm}}
\pgfpathclose
\pgfusepath{clip}
\begin{pgfscope}
\begin{pgfscope}
\pgfpathmoveto{\pgfqpoint{0cm}{-0.035cm}}
\pgfpathlineto{\pgfqpoint{1.376cm}{-0.035cm}}
\pgfpathlineto{\pgfqpoint{1.376cm}{1.552cm}}
\pgfpathlineto{\pgfqpoint{0cm}{1.552cm}}
\pgfpathclose
\pgfusepath{clip}
\begin{pgfscope}
\begin{pgfscope}
\pgfsetdash{}{0cm}
\pgfsetlinewidth{0.818mm}
\pgfsetroundcap
\pgfsetroundjoin
\pgfsetmiterlimit{7.0}
\definecolor{eps2pgf_color}{gray}{0}\pgfsetstrokecolor{eps2pgf_color}\pgfsetfillcolor{eps2pgf_color}
\pgfpathmoveto{\pgfqpoint{0.117cm}{1.421cm}}
\pgfpathlineto{\pgfqpoint{0.682cm}{0.671cm}}
\pgfpathlineto{\pgfqpoint{1.246cm}{1.421cm}}
\pgfusepath{stroke}
\end{pgfscope}
\definecolor{eps2pgf_color}{gray}{0}\pgfsetstrokecolor{eps2pgf_color}\pgfsetfillcolor{eps2pgf_color}
\pgfpathmoveto{\pgfqpoint{0.273cm}{1.395cm}}
\pgfpathcurveto{\pgfqpoint{0.273cm}{1.432cm}}{\pgfqpoint{0.259cm}{1.467cm}}{\pgfqpoint{0.233cm}{1.492cm}}
\pgfpathcurveto{\pgfqpoint{0.207cm}{1.518cm}}{\pgfqpoint{0.173cm}{1.532cm}}{\pgfqpoint{0.137cm}{1.532cm}}
\pgfpathcurveto{\pgfqpoint{0.1cm}{1.532cm}}{\pgfqpoint{0.066cm}{1.518cm}}{\pgfqpoint{0.04cm}{1.492cm}}
\pgfpathcurveto{\pgfqpoint{0.014cm}{1.467cm}}{\pgfqpoint{0cm}{1.432cm}}{\pgfqpoint{0cm}{1.395cm}}
\pgfpathcurveto{\pgfqpoint{0cm}{1.359cm}}{\pgfqpoint{0.014cm}{1.324cm}}{\pgfqpoint{0.04cm}{1.299cm}}
\pgfpathcurveto{\pgfqpoint{0.066cm}{1.273cm}}{\pgfqpoint{0.1cm}{1.258cm}}{\pgfqpoint{0.137cm}{1.258cm}}
\pgfpathcurveto{\pgfqpoint{0.173cm}{1.258cm}}{\pgfqpoint{0.207cm}{1.273cm}}{\pgfqpoint{0.233cm}{1.299cm}}
\pgfpathcurveto{\pgfqpoint{0.259cm}{1.324cm}}{\pgfqpoint{0.273cm}{1.359cm}}{\pgfqpoint{0.273cm}{1.395cm}}
\pgfusepath{fill}
\begin{pgfscope}
\pgfsetdash{}{0cm}
\pgfsetlinewidth{0.818mm}
\pgfsetmiterlimit{7.0}
\pgfpathmoveto{\pgfqpoint{0.682cm}{0.671cm}}
\pgfpathlineto{\pgfqpoint{0.679cm}{1.418cm}}
\pgfusepath{stroke}
\end{pgfscope}
\pgfpathmoveto{\pgfqpoint{0.815cm}{1.399cm}}
\pgfpathcurveto{\pgfqpoint{0.815cm}{1.435cm}}{\pgfqpoint{0.801cm}{1.47cm}}{\pgfqpoint{0.775cm}{1.496cm}}
\pgfpathcurveto{\pgfqpoint{0.75cm}{1.521cm}}{\pgfqpoint{0.715cm}{1.536cm}}{\pgfqpoint{0.679cm}{1.536cm}}
\pgfpathcurveto{\pgfqpoint{0.643cm}{1.536cm}}{\pgfqpoint{0.608cm}{1.521cm}}{\pgfqpoint{0.582cm}{1.496cm}}
\pgfpathcurveto{\pgfqpoint{0.557cm}{1.47cm}}{\pgfqpoint{0.542cm}{1.435cm}}{\pgfqpoint{0.542cm}{1.399cm}}
\pgfpathcurveto{\pgfqpoint{0.542cm}{1.363cm}}{\pgfqpoint{0.557cm}{1.328cm}}{\pgfqpoint{0.582cm}{1.302cm}}
\pgfpathcurveto{\pgfqpoint{0.608cm}{1.276cm}}{\pgfqpoint{0.643cm}{1.262cm}}{\pgfqpoint{0.679cm}{1.262cm}}
\pgfpathcurveto{\pgfqpoint{0.715cm}{1.262cm}}{\pgfqpoint{0.75cm}{1.276cm}}{\pgfqpoint{0.775cm}{1.302cm}}
\pgfpathcurveto{\pgfqpoint{0.801cm}{1.328cm}}{\pgfqpoint{0.815cm}{1.363cm}}{\pgfqpoint{0.815cm}{1.399cm}}
\pgfusepath{fill}
\pgfpathmoveto{\pgfqpoint{1.345cm}{1.371cm}}
\pgfpathcurveto{\pgfqpoint{1.345cm}{1.408cm}}{\pgfqpoint{1.331cm}{1.442cm}}{\pgfqpoint{1.305cm}{1.468cm}}
\pgfpathcurveto{\pgfqpoint{1.28cm}{1.494cm}}{\pgfqpoint{1.245cm}{1.508cm}}{\pgfqpoint{1.209cm}{1.508cm}}
\pgfpathcurveto{\pgfqpoint{1.172cm}{1.508cm}}{\pgfqpoint{1.138cm}{1.494cm}}{\pgfqpoint{1.112cm}{1.468cm}}
\pgfpathcurveto{\pgfqpoint{1.087cm}{1.442cm}}{\pgfqpoint{1.072cm}{1.408cm}}{\pgfqpoint{1.072cm}{1.371cm}}
\pgfpathcurveto{\pgfqpoint{1.072cm}{1.335cm}}{\pgfqpoint{1.087cm}{1.3cm}}{\pgfqpoint{1.112cm}{1.274cm}}
\pgfpathcurveto{\pgfqpoint{1.138cm}{1.249cm}}{\pgfqpoint{1.172cm}{1.234cm}}{\pgfqpoint{1.209cm}{1.234cm}}
\pgfpathcurveto{\pgfqpoint{1.245cm}{1.234cm}}{\pgfqpoint{1.28cm}{1.249cm}}{\pgfqpoint{1.305cm}{1.274cm}}
\pgfpathcurveto{\pgfqpoint{1.331cm}{1.3cm}}{\pgfqpoint{1.345cm}{1.335cm}}{\pgfqpoint{1.345cm}{1.371cm}}
\pgfusepath{fill}
\begin{pgfscope}
\pgfsetdash{}{0cm}
\pgfsetlinewidth{0.818mm}
\pgfsetroundcap
\pgfsetmiterlimit{4.0}
\pgfpathmoveto{\pgfqpoint{0.682cm}{0.671cm}}
\pgfpathlineto{\pgfqpoint{0.682cm}{0.042cm}}
\pgfusepath{stroke}
\end{pgfscope}
\end{pgfscope}
\end{pgfscope}
\end{pgfscope}
\end{tikzpicture}}} + \phi
     + \psi) \right) \precprec X^{\!\resizebox{0.6em}{!}{
\begin{tikzpicture}
\pgfpathmoveto{\pgfqpoint{0cm}{0cm}}
\pgfpathlineto{\pgfqpoint{1.376cm}{0cm}}
\pgfpathlineto{\pgfqpoint{1.376cm}{1.588cm}}
\pgfpathlineto{\pgfqpoint{0cm}{1.588cm}}
\pgfpathclose
\pgfusepath{clip}
\begin{pgfscope}
\begin{pgfscope}
\pgfpathmoveto{\pgfqpoint{0cm}{0cm}}
\pgfpathlineto{\pgfqpoint{1.376cm}{0cm}}
\pgfpathlineto{\pgfqpoint{1.376cm}{1.588cm}}
\pgfpathlineto{\pgfqpoint{0cm}{1.588cm}}
\pgfpathclose
\pgfusepath{clip}
\begin{pgfscope}
\begin{pgfscope}
\definecolor{eps2pgf_color}{gray}{0.976471}\pgfsetstrokecolor{eps2pgf_color}\pgfsetfillcolor{eps2pgf_color}
\pgfpathmoveto{\pgfqpoint{0cm}{0cm}}
\pgfpathlineto{\pgfqpoint{1.376cm}{0cm}}
\pgfpathlineto{\pgfqpoint{1.376cm}{1.588cm}}
\pgfpathlineto{\pgfqpoint{0cm}{1.588cm}}
\pgfpathclose
\pgfusepath{fill}
\end{pgfscope}
\begin{pgfscope}
\pgfsetdash{}{0cm}
\pgfsetlinewidth{0.818mm}
\pgfsetroundcap
\pgfsetroundjoin
\pgfsetmiterlimit{7.0}
\definecolor{eps2pgf_color}{gray}{0}\pgfsetstrokecolor{eps2pgf_color}\pgfsetfillcolor{eps2pgf_color}
\pgfpathmoveto{\pgfqpoint{0.117cm}{1.476cm}}
\pgfpathlineto{\pgfqpoint{0.682cm}{0.726cm}}
\pgfpathlineto{\pgfqpoint{1.246cm}{1.476cm}}
\pgfusepath{stroke}
\end{pgfscope}
\definecolor{eps2pgf_color}{gray}{0}\pgfsetstrokecolor{eps2pgf_color}\pgfsetfillcolor{eps2pgf_color}
\pgfpathmoveto{\pgfqpoint{0.273cm}{1.451cm}}
\pgfpathcurveto{\pgfqpoint{0.273cm}{1.487cm}}{\pgfqpoint{0.259cm}{1.522cm}}{\pgfqpoint{0.233cm}{1.547cm}}
\pgfpathcurveto{\pgfqpoint{0.207cm}{1.573cm}}{\pgfqpoint{0.173cm}{1.588cm}}{\pgfqpoint{0.137cm}{1.588cm}}
\pgfpathcurveto{\pgfqpoint{0.1cm}{1.588cm}}{\pgfqpoint{0.066cm}{1.573cm}}{\pgfqpoint{0.04cm}{1.547cm}}
\pgfpathcurveto{\pgfqpoint{0.014cm}{1.522cm}}{\pgfqpoint{0cm}{1.487cm}}{\pgfqpoint{0cm}{1.451cm}}
\pgfpathcurveto{\pgfqpoint{0cm}{1.414cm}}{\pgfqpoint{0.014cm}{1.379cm}}{\pgfqpoint{0.04cm}{1.354cm}}
\pgfpathcurveto{\pgfqpoint{0.066cm}{1.328cm}}{\pgfqpoint{0.1cm}{1.314cm}}{\pgfqpoint{0.137cm}{1.314cm}}
\pgfpathcurveto{\pgfqpoint{0.173cm}{1.314cm}}{\pgfqpoint{0.207cm}{1.328cm}}{\pgfqpoint{0.233cm}{1.354cm}}
\pgfpathcurveto{\pgfqpoint{0.259cm}{1.379cm}}{\pgfqpoint{0.273cm}{1.414cm}}{\pgfqpoint{0.273cm}{1.451cm}}
\pgfusepath{fill}
\pgfpathmoveto{\pgfqpoint{1.345cm}{1.426cm}}
\pgfpathcurveto{\pgfqpoint{1.345cm}{1.463cm}}{\pgfqpoint{1.331cm}{1.497cm}}{\pgfqpoint{1.305cm}{1.523cm}}
\pgfpathcurveto{\pgfqpoint{1.28cm}{1.549cm}}{\pgfqpoint{1.245cm}{1.563cm}}{\pgfqpoint{1.209cm}{1.563cm}}
\pgfpathcurveto{\pgfqpoint{1.172cm}{1.563cm}}{\pgfqpoint{1.138cm}{1.549cm}}{\pgfqpoint{1.112cm}{1.523cm}}
\pgfpathcurveto{\pgfqpoint{1.087cm}{1.497cm}}{\pgfqpoint{1.072cm}{1.463cm}}{\pgfqpoint{1.072cm}{1.426cm}}
\pgfpathcurveto{\pgfqpoint{1.072cm}{1.39cm}}{\pgfqpoint{1.087cm}{1.355cm}}{\pgfqpoint{1.112cm}{1.329cm}}
\pgfpathcurveto{\pgfqpoint{1.138cm}{1.304cm}}{\pgfqpoint{1.172cm}{1.289cm}}{\pgfqpoint{1.209cm}{1.289cm}}
\pgfpathcurveto{\pgfqpoint{1.245cm}{1.289cm}}{\pgfqpoint{1.28cm}{1.304cm}}{\pgfqpoint{1.305cm}{1.329cm}}
\pgfpathcurveto{\pgfqpoint{1.331cm}{1.355cm}}{\pgfqpoint{1.345cm}{1.39cm}}{\pgfqpoint{1.345cm}{1.426cm}}
\pgfusepath{fill}
\begin{pgfscope}
\pgfsetdash{}{0cm}
\pgfsetlinewidth{0.818mm}
\pgfsetroundcap
\pgfsetmiterlimit{4.0}
\pgfpathmoveto{\pgfqpoint{0.682cm}{0.726cm}}
\pgfpathlineto{\pgfqpoint{0.682cm}{0.097cm}}
\pgfusepath{stroke}
\end{pgfscope}
\end{pgfscope}
\end{pgfscope}
\end{pgfscope}
\end{tikzpicture}}} \right\|_{C \CC^{1 - \kappa}
     (\rho_{}^{3 + \gamma'})} \lesssim 1 + \| \tau^{1 / 2} (\phi + \psi)
     \|_{L^{\infty} L^{\infty} (\rho)} . \]
  The second term can be estimated using Lemma \ref{lem:5.1} in $\CC^{- 1 +
  \alpha} (\rho^{3 + \gamma'})$. So we again apply Lemma~\ref{lemma:tau-weighted-schauder} with $\beta_i = 0$ to deduce (since $2 +
  \alpha \geq 1 + \nu$)
  \[ \| \tmop{com}_{22} \|_{C \CC^{- 1 + \alpha} (\tau^{(2 + \alpha) / 2}
     \rho_{}^{3 + \gamma'})} \]
  \[ \lesssim 1 + \left\| \tau^{\frac{1 + \nu}{2}} (\phi + \psi)
     \right\|_{C^{(\alpha + \kappa) / 2} L^{\infty} (\rho^{3 + \gamma''})} +
     \left\| \tau^{\frac{1 + \nu}{2}} (\phi + \psi) \right\|_{C^{} \CC^{\alpha
     + \kappa} (\rho^{3 + \gamma''})} . \]
  All the other terms in $\breve{\Theta}$ can be estimated as in Section
  \ref{sec:43}, or more precisely pointwise in time by the approach of Section \ref{ssec:theta}. To summarize, Lemma \ref{lemma:tau-weighted-schauder} gives
  \[ \| \breve{\vartheta} \|_{C \CC^{1 + \alpha} (\tau^{(2 + \alpha) / 2}
     \rho^{3+\gamma'})} \lesssim \| \breve{\vartheta} \|_{C \CC^{1 / 2 +
     \alpha} (\tau^{3 / 4 + \alpha / 2} \rho^{3 / 2 + \alpha})} + 1 \]
  \[ + \| \psi \|_{C \CC^{1 / 2 + \alpha} (\tau^{3 / 4 + \alpha / 2} \rho^{3 /
     2 + \alpha})} + \| \psi \|_{L^{\infty} L^{\infty} (\tau^{1 / 2} \rho)}^{1
     + \varepsilon} \]
  \[ + \| \tau^{\frac{1 + \nu}{2}} (\phi + \psi) \|_{C^{(\alpha + \kappa)
     / 2} L^{\infty} (\rho^{3 + \gamma''})} + \left\| \tau^{\frac{1 + \nu}{2}}
     (\phi + \psi) \right\|_{C^{} \CC^{\alpha + \kappa} (\rho^{3 +
     \gamma''})}, \]
  where the only term requiring some time regularity is the second last one. This
  is the reason we introduced the modified $\vartheta$. In order to estimate
  the first term on the right hand side, we use the definition of
  $\breve{\vartheta}$ in \eqref{eq:th2} together with \eqref{eq:ph-down} to obtain (provided $3/2 + \alpha \geq 1+ \nu$)
  \[ \| \breve{\vartheta} \|_{C \CC^{1 / 2 + \alpha} (\tau^{3 / 4 + \alpha /
     2} \rho^{3 / 2 + \alpha})} \leqslant \| \phi \|_{C \CC^{1 / 2 + \alpha}
     (\tau^{3 / 4 + \alpha / 2} \rho^{3 / 2 + \alpha})} \]
  \[ + \left\| \tau^{- \frac{1 + \nu}{2}} (3 [\tau^{\frac{1 + \nu}{2}} (-
     X^{\!\resizebox{0.6em}{!}{
\begin{tikzpicture}
\pgfpathmoveto{\pgfqpoint{0cm}{-0.035cm}}
\pgfpathlineto{\pgfqpoint{1.376cm}{-0.035cm}}
\pgfpathlineto{\pgfqpoint{1.376cm}{1.552cm}}
\pgfpathlineto{\pgfqpoint{0cm}{1.552cm}}
\pgfpathclose
\pgfusepath{clip}
\begin{pgfscope}
\begin{pgfscope}
\pgfpathmoveto{\pgfqpoint{0cm}{-0.035cm}}
\pgfpathlineto{\pgfqpoint{1.376cm}{-0.035cm}}
\pgfpathlineto{\pgfqpoint{1.376cm}{1.552cm}}
\pgfpathlineto{\pgfqpoint{0cm}{1.552cm}}
\pgfpathclose
\pgfusepath{clip}
\begin{pgfscope}
\begin{pgfscope}
\pgfsetdash{}{0cm}
\pgfsetlinewidth{0.818mm}
\pgfsetroundcap
\pgfsetroundjoin
\pgfsetmiterlimit{7.0}
\definecolor{eps2pgf_color}{gray}{0}\pgfsetstrokecolor{eps2pgf_color}\pgfsetfillcolor{eps2pgf_color}
\pgfpathmoveto{\pgfqpoint{0.117cm}{1.421cm}}
\pgfpathlineto{\pgfqpoint{0.682cm}{0.671cm}}
\pgfpathlineto{\pgfqpoint{1.246cm}{1.421cm}}
\pgfusepath{stroke}
\end{pgfscope}
\definecolor{eps2pgf_color}{gray}{0}\pgfsetstrokecolor{eps2pgf_color}\pgfsetfillcolor{eps2pgf_color}
\pgfpathmoveto{\pgfqpoint{0.273cm}{1.395cm}}
\pgfpathcurveto{\pgfqpoint{0.273cm}{1.432cm}}{\pgfqpoint{0.259cm}{1.467cm}}{\pgfqpoint{0.233cm}{1.492cm}}
\pgfpathcurveto{\pgfqpoint{0.207cm}{1.518cm}}{\pgfqpoint{0.173cm}{1.532cm}}{\pgfqpoint{0.137cm}{1.532cm}}
\pgfpathcurveto{\pgfqpoint{0.1cm}{1.532cm}}{\pgfqpoint{0.066cm}{1.518cm}}{\pgfqpoint{0.04cm}{1.492cm}}
\pgfpathcurveto{\pgfqpoint{0.014cm}{1.467cm}}{\pgfqpoint{0cm}{1.432cm}}{\pgfqpoint{0cm}{1.395cm}}
\pgfpathcurveto{\pgfqpoint{0cm}{1.359cm}}{\pgfqpoint{0.014cm}{1.324cm}}{\pgfqpoint{0.04cm}{1.299cm}}
\pgfpathcurveto{\pgfqpoint{0.066cm}{1.273cm}}{\pgfqpoint{0.1cm}{1.258cm}}{\pgfqpoint{0.137cm}{1.258cm}}
\pgfpathcurveto{\pgfqpoint{0.173cm}{1.258cm}}{\pgfqpoint{0.207cm}{1.273cm}}{\pgfqpoint{0.233cm}{1.299cm}}
\pgfpathcurveto{\pgfqpoint{0.259cm}{1.324cm}}{\pgfqpoint{0.273cm}{1.359cm}}{\pgfqpoint{0.273cm}{1.395cm}}
\pgfusepath{fill}
\begin{pgfscope}
\pgfsetdash{}{0cm}
\pgfsetlinewidth{0.818mm}
\pgfsetmiterlimit{7.0}
\pgfpathmoveto{\pgfqpoint{0.682cm}{0.671cm}}
\pgfpathlineto{\pgfqpoint{0.679cm}{1.418cm}}
\pgfusepath{stroke}
\end{pgfscope}
\pgfpathmoveto{\pgfqpoint{0.815cm}{1.399cm}}
\pgfpathcurveto{\pgfqpoint{0.815cm}{1.435cm}}{\pgfqpoint{0.801cm}{1.47cm}}{\pgfqpoint{0.775cm}{1.496cm}}
\pgfpathcurveto{\pgfqpoint{0.75cm}{1.521cm}}{\pgfqpoint{0.715cm}{1.536cm}}{\pgfqpoint{0.679cm}{1.536cm}}
\pgfpathcurveto{\pgfqpoint{0.643cm}{1.536cm}}{\pgfqpoint{0.608cm}{1.521cm}}{\pgfqpoint{0.582cm}{1.496cm}}
\pgfpathcurveto{\pgfqpoint{0.557cm}{1.47cm}}{\pgfqpoint{0.542cm}{1.435cm}}{\pgfqpoint{0.542cm}{1.399cm}}
\pgfpathcurveto{\pgfqpoint{0.542cm}{1.363cm}}{\pgfqpoint{0.557cm}{1.328cm}}{\pgfqpoint{0.582cm}{1.302cm}}
\pgfpathcurveto{\pgfqpoint{0.608cm}{1.276cm}}{\pgfqpoint{0.643cm}{1.262cm}}{\pgfqpoint{0.679cm}{1.262cm}}
\pgfpathcurveto{\pgfqpoint{0.715cm}{1.262cm}}{\pgfqpoint{0.75cm}{1.276cm}}{\pgfqpoint{0.775cm}{1.302cm}}
\pgfpathcurveto{\pgfqpoint{0.801cm}{1.328cm}}{\pgfqpoint{0.815cm}{1.363cm}}{\pgfqpoint{0.815cm}{1.399cm}}
\pgfusepath{fill}
\pgfpathmoveto{\pgfqpoint{1.345cm}{1.371cm}}
\pgfpathcurveto{\pgfqpoint{1.345cm}{1.408cm}}{\pgfqpoint{1.331cm}{1.442cm}}{\pgfqpoint{1.305cm}{1.468cm}}
\pgfpathcurveto{\pgfqpoint{1.28cm}{1.494cm}}{\pgfqpoint{1.245cm}{1.508cm}}{\pgfqpoint{1.209cm}{1.508cm}}
\pgfpathcurveto{\pgfqpoint{1.172cm}{1.508cm}}{\pgfqpoint{1.138cm}{1.494cm}}{\pgfqpoint{1.112cm}{1.468cm}}
\pgfpathcurveto{\pgfqpoint{1.087cm}{1.442cm}}{\pgfqpoint{1.072cm}{1.408cm}}{\pgfqpoint{1.072cm}{1.371cm}}
\pgfpathcurveto{\pgfqpoint{1.072cm}{1.335cm}}{\pgfqpoint{1.087cm}{1.3cm}}{\pgfqpoint{1.112cm}{1.274cm}}
\pgfpathcurveto{\pgfqpoint{1.138cm}{1.249cm}}{\pgfqpoint{1.172cm}{1.234cm}}{\pgfqpoint{1.209cm}{1.234cm}}
\pgfpathcurveto{\pgfqpoint{1.245cm}{1.234cm}}{\pgfqpoint{1.28cm}{1.249cm}}{\pgfqpoint{1.305cm}{1.274cm}}
\pgfpathcurveto{\pgfqpoint{1.331cm}{1.3cm}}{\pgfqpoint{1.345cm}{1.335cm}}{\pgfqpoint{1.345cm}{1.371cm}}
\pgfusepath{fill}
\begin{pgfscope}
\pgfsetdash{}{0cm}
\pgfsetlinewidth{0.818mm}
\pgfsetroundcap
\pgfsetmiterlimit{4.0}
\pgfpathmoveto{\pgfqpoint{0.682cm}{0.671cm}}
\pgfpathlineto{\pgfqpoint{0.682cm}{0.042cm}}
\pgfusepath{stroke}
\end{pgfscope}
\end{pgfscope}
\end{pgfscope}
\end{pgfscope}
\end{tikzpicture}}} + \phi + \psi)] \precprec X^{\!\resizebox{0.6em}{!}{
\begin{tikzpicture}
\pgfpathmoveto{\pgfqpoint{0cm}{0cm}}
\pgfpathlineto{\pgfqpoint{1.376cm}{0cm}}
\pgfpathlineto{\pgfqpoint{1.376cm}{1.588cm}}
\pgfpathlineto{\pgfqpoint{0cm}{1.588cm}}
\pgfpathclose
\pgfusepath{clip}
\begin{pgfscope}
\begin{pgfscope}
\pgfpathmoveto{\pgfqpoint{0cm}{0cm}}
\pgfpathlineto{\pgfqpoint{1.376cm}{0cm}}
\pgfpathlineto{\pgfqpoint{1.376cm}{1.588cm}}
\pgfpathlineto{\pgfqpoint{0cm}{1.588cm}}
\pgfpathclose
\pgfusepath{clip}
\begin{pgfscope}
\begin{pgfscope}
\definecolor{eps2pgf_color}{gray}{0.976471}\pgfsetstrokecolor{eps2pgf_color}\pgfsetfillcolor{eps2pgf_color}
\pgfpathmoveto{\pgfqpoint{0cm}{0cm}}
\pgfpathlineto{\pgfqpoint{1.376cm}{0cm}}
\pgfpathlineto{\pgfqpoint{1.376cm}{1.588cm}}
\pgfpathlineto{\pgfqpoint{0cm}{1.588cm}}
\pgfpathclose
\pgfusepath{fill}
\end{pgfscope}
\begin{pgfscope}
\pgfsetdash{}{0cm}
\pgfsetlinewidth{0.818mm}
\pgfsetroundcap
\pgfsetroundjoin
\pgfsetmiterlimit{7.0}
\definecolor{eps2pgf_color}{gray}{0}\pgfsetstrokecolor{eps2pgf_color}\pgfsetfillcolor{eps2pgf_color}
\pgfpathmoveto{\pgfqpoint{0.117cm}{1.476cm}}
\pgfpathlineto{\pgfqpoint{0.682cm}{0.726cm}}
\pgfpathlineto{\pgfqpoint{1.246cm}{1.476cm}}
\pgfusepath{stroke}
\end{pgfscope}
\definecolor{eps2pgf_color}{gray}{0}\pgfsetstrokecolor{eps2pgf_color}\pgfsetfillcolor{eps2pgf_color}
\pgfpathmoveto{\pgfqpoint{0.273cm}{1.451cm}}
\pgfpathcurveto{\pgfqpoint{0.273cm}{1.487cm}}{\pgfqpoint{0.259cm}{1.522cm}}{\pgfqpoint{0.233cm}{1.547cm}}
\pgfpathcurveto{\pgfqpoint{0.207cm}{1.573cm}}{\pgfqpoint{0.173cm}{1.588cm}}{\pgfqpoint{0.137cm}{1.588cm}}
\pgfpathcurveto{\pgfqpoint{0.1cm}{1.588cm}}{\pgfqpoint{0.066cm}{1.573cm}}{\pgfqpoint{0.04cm}{1.547cm}}
\pgfpathcurveto{\pgfqpoint{0.014cm}{1.522cm}}{\pgfqpoint{0cm}{1.487cm}}{\pgfqpoint{0cm}{1.451cm}}
\pgfpathcurveto{\pgfqpoint{0cm}{1.414cm}}{\pgfqpoint{0.014cm}{1.379cm}}{\pgfqpoint{0.04cm}{1.354cm}}
\pgfpathcurveto{\pgfqpoint{0.066cm}{1.328cm}}{\pgfqpoint{0.1cm}{1.314cm}}{\pgfqpoint{0.137cm}{1.314cm}}
\pgfpathcurveto{\pgfqpoint{0.173cm}{1.314cm}}{\pgfqpoint{0.207cm}{1.328cm}}{\pgfqpoint{0.233cm}{1.354cm}}
\pgfpathcurveto{\pgfqpoint{0.259cm}{1.379cm}}{\pgfqpoint{0.273cm}{1.414cm}}{\pgfqpoint{0.273cm}{1.451cm}}
\pgfusepath{fill}
\pgfpathmoveto{\pgfqpoint{1.345cm}{1.426cm}}
\pgfpathcurveto{\pgfqpoint{1.345cm}{1.463cm}}{\pgfqpoint{1.331cm}{1.497cm}}{\pgfqpoint{1.305cm}{1.523cm}}
\pgfpathcurveto{\pgfqpoint{1.28cm}{1.549cm}}{\pgfqpoint{1.245cm}{1.563cm}}{\pgfqpoint{1.209cm}{1.563cm}}
\pgfpathcurveto{\pgfqpoint{1.172cm}{1.563cm}}{\pgfqpoint{1.138cm}{1.549cm}}{\pgfqpoint{1.112cm}{1.523cm}}
\pgfpathcurveto{\pgfqpoint{1.087cm}{1.497cm}}{\pgfqpoint{1.072cm}{1.463cm}}{\pgfqpoint{1.072cm}{1.426cm}}
\pgfpathcurveto{\pgfqpoint{1.072cm}{1.39cm}}{\pgfqpoint{1.087cm}{1.355cm}}{\pgfqpoint{1.112cm}{1.329cm}}
\pgfpathcurveto{\pgfqpoint{1.138cm}{1.304cm}}{\pgfqpoint{1.172cm}{1.289cm}}{\pgfqpoint{1.209cm}{1.289cm}}
\pgfpathcurveto{\pgfqpoint{1.245cm}{1.289cm}}{\pgfqpoint{1.28cm}{1.304cm}}{\pgfqpoint{1.305cm}{1.329cm}}
\pgfpathcurveto{\pgfqpoint{1.331cm}{1.355cm}}{\pgfqpoint{1.345cm}{1.39cm}}{\pgfqpoint{1.345cm}{1.426cm}}
\pgfusepath{fill}
\begin{pgfscope}
\pgfsetdash{}{0cm}
\pgfsetlinewidth{0.818mm}
\pgfsetroundcap
\pgfsetmiterlimit{4.0}
\pgfpathmoveto{\pgfqpoint{0.682cm}{0.726cm}}
\pgfpathlineto{\pgfqpoint{0.682cm}{0.097cm}}
\pgfusepath{stroke}
\end{pgfscope}
\end{pgfscope}
\end{pgfscope}
\end{pgfscope}
\end{tikzpicture}}}) \right\|_{C \CC^{1 /
     2 + \alpha} (\tau^{3 / 4 + \alpha / 2} \rho^{3 / 2 + \alpha})} \]
  \[ \lesssim 1 + \| \psi \|_{L^{\infty} L^{\infty} (\tau^{1 / 2}
     \rho)}^{\varepsilon} + \left\| \tau^{- \frac{1+\nu}{2}} ([\tau^{\frac{1 +
     \nu}{2}} (- X^{\!\resizebox{0.6em}{!}{
\begin{tikzpicture}
\pgfpathmoveto{\pgfqpoint{0cm}{-0.035cm}}
\pgfpathlineto{\pgfqpoint{1.376cm}{-0.035cm}}
\pgfpathlineto{\pgfqpoint{1.376cm}{1.552cm}}
\pgfpathlineto{\pgfqpoint{0cm}{1.552cm}}
\pgfpathclose
\pgfusepath{clip}
\begin{pgfscope}
\begin{pgfscope}
\pgfpathmoveto{\pgfqpoint{0cm}{-0.035cm}}
\pgfpathlineto{\pgfqpoint{1.376cm}{-0.035cm}}
\pgfpathlineto{\pgfqpoint{1.376cm}{1.552cm}}
\pgfpathlineto{\pgfqpoint{0cm}{1.552cm}}
\pgfpathclose
\pgfusepath{clip}
\begin{pgfscope}
\begin{pgfscope}
\pgfsetdash{}{0cm}
\pgfsetlinewidth{0.818mm}
\pgfsetroundcap
\pgfsetroundjoin
\pgfsetmiterlimit{7.0}
\definecolor{eps2pgf_color}{gray}{0}\pgfsetstrokecolor{eps2pgf_color}\pgfsetfillcolor{eps2pgf_color}
\pgfpathmoveto{\pgfqpoint{0.117cm}{1.421cm}}
\pgfpathlineto{\pgfqpoint{0.682cm}{0.671cm}}
\pgfpathlineto{\pgfqpoint{1.246cm}{1.421cm}}
\pgfusepath{stroke}
\end{pgfscope}
\definecolor{eps2pgf_color}{gray}{0}\pgfsetstrokecolor{eps2pgf_color}\pgfsetfillcolor{eps2pgf_color}
\pgfpathmoveto{\pgfqpoint{0.273cm}{1.395cm}}
\pgfpathcurveto{\pgfqpoint{0.273cm}{1.432cm}}{\pgfqpoint{0.259cm}{1.467cm}}{\pgfqpoint{0.233cm}{1.492cm}}
\pgfpathcurveto{\pgfqpoint{0.207cm}{1.518cm}}{\pgfqpoint{0.173cm}{1.532cm}}{\pgfqpoint{0.137cm}{1.532cm}}
\pgfpathcurveto{\pgfqpoint{0.1cm}{1.532cm}}{\pgfqpoint{0.066cm}{1.518cm}}{\pgfqpoint{0.04cm}{1.492cm}}
\pgfpathcurveto{\pgfqpoint{0.014cm}{1.467cm}}{\pgfqpoint{0cm}{1.432cm}}{\pgfqpoint{0cm}{1.395cm}}
\pgfpathcurveto{\pgfqpoint{0cm}{1.359cm}}{\pgfqpoint{0.014cm}{1.324cm}}{\pgfqpoint{0.04cm}{1.299cm}}
\pgfpathcurveto{\pgfqpoint{0.066cm}{1.273cm}}{\pgfqpoint{0.1cm}{1.258cm}}{\pgfqpoint{0.137cm}{1.258cm}}
\pgfpathcurveto{\pgfqpoint{0.173cm}{1.258cm}}{\pgfqpoint{0.207cm}{1.273cm}}{\pgfqpoint{0.233cm}{1.299cm}}
\pgfpathcurveto{\pgfqpoint{0.259cm}{1.324cm}}{\pgfqpoint{0.273cm}{1.359cm}}{\pgfqpoint{0.273cm}{1.395cm}}
\pgfusepath{fill}
\begin{pgfscope}
\pgfsetdash{}{0cm}
\pgfsetlinewidth{0.818mm}
\pgfsetmiterlimit{7.0}
\pgfpathmoveto{\pgfqpoint{0.682cm}{0.671cm}}
\pgfpathlineto{\pgfqpoint{0.679cm}{1.418cm}}
\pgfusepath{stroke}
\end{pgfscope}
\pgfpathmoveto{\pgfqpoint{0.815cm}{1.399cm}}
\pgfpathcurveto{\pgfqpoint{0.815cm}{1.435cm}}{\pgfqpoint{0.801cm}{1.47cm}}{\pgfqpoint{0.775cm}{1.496cm}}
\pgfpathcurveto{\pgfqpoint{0.75cm}{1.521cm}}{\pgfqpoint{0.715cm}{1.536cm}}{\pgfqpoint{0.679cm}{1.536cm}}
\pgfpathcurveto{\pgfqpoint{0.643cm}{1.536cm}}{\pgfqpoint{0.608cm}{1.521cm}}{\pgfqpoint{0.582cm}{1.496cm}}
\pgfpathcurveto{\pgfqpoint{0.557cm}{1.47cm}}{\pgfqpoint{0.542cm}{1.435cm}}{\pgfqpoint{0.542cm}{1.399cm}}
\pgfpathcurveto{\pgfqpoint{0.542cm}{1.363cm}}{\pgfqpoint{0.557cm}{1.328cm}}{\pgfqpoint{0.582cm}{1.302cm}}
\pgfpathcurveto{\pgfqpoint{0.608cm}{1.276cm}}{\pgfqpoint{0.643cm}{1.262cm}}{\pgfqpoint{0.679cm}{1.262cm}}
\pgfpathcurveto{\pgfqpoint{0.715cm}{1.262cm}}{\pgfqpoint{0.75cm}{1.276cm}}{\pgfqpoint{0.775cm}{1.302cm}}
\pgfpathcurveto{\pgfqpoint{0.801cm}{1.328cm}}{\pgfqpoint{0.815cm}{1.363cm}}{\pgfqpoint{0.815cm}{1.399cm}}
\pgfusepath{fill}
\pgfpathmoveto{\pgfqpoint{1.345cm}{1.371cm}}
\pgfpathcurveto{\pgfqpoint{1.345cm}{1.408cm}}{\pgfqpoint{1.331cm}{1.442cm}}{\pgfqpoint{1.305cm}{1.468cm}}
\pgfpathcurveto{\pgfqpoint{1.28cm}{1.494cm}}{\pgfqpoint{1.245cm}{1.508cm}}{\pgfqpoint{1.209cm}{1.508cm}}
\pgfpathcurveto{\pgfqpoint{1.172cm}{1.508cm}}{\pgfqpoint{1.138cm}{1.494cm}}{\pgfqpoint{1.112cm}{1.468cm}}
\pgfpathcurveto{\pgfqpoint{1.087cm}{1.442cm}}{\pgfqpoint{1.072cm}{1.408cm}}{\pgfqpoint{1.072cm}{1.371cm}}
\pgfpathcurveto{\pgfqpoint{1.072cm}{1.335cm}}{\pgfqpoint{1.087cm}{1.3cm}}{\pgfqpoint{1.112cm}{1.274cm}}
\pgfpathcurveto{\pgfqpoint{1.138cm}{1.249cm}}{\pgfqpoint{1.172cm}{1.234cm}}{\pgfqpoint{1.209cm}{1.234cm}}
\pgfpathcurveto{\pgfqpoint{1.245cm}{1.234cm}}{\pgfqpoint{1.28cm}{1.249cm}}{\pgfqpoint{1.305cm}{1.274cm}}
\pgfpathcurveto{\pgfqpoint{1.331cm}{1.3cm}}{\pgfqpoint{1.345cm}{1.335cm}}{\pgfqpoint{1.345cm}{1.371cm}}
\pgfusepath{fill}
\begin{pgfscope}
\pgfsetdash{}{0cm}
\pgfsetlinewidth{0.818mm}
\pgfsetroundcap
\pgfsetmiterlimit{4.0}
\pgfpathmoveto{\pgfqpoint{0.682cm}{0.671cm}}
\pgfpathlineto{\pgfqpoint{0.682cm}{0.042cm}}
\pgfusepath{stroke}
\end{pgfscope}
\end{pgfscope}
\end{pgfscope}
\end{pgfscope}
\end{tikzpicture}}} + \phi + \psi)] \precprec X^{\!\resizebox{0.6em}{!}{
\begin{tikzpicture}
\pgfpathmoveto{\pgfqpoint{0cm}{0cm}}
\pgfpathlineto{\pgfqpoint{1.376cm}{0cm}}
\pgfpathlineto{\pgfqpoint{1.376cm}{1.588cm}}
\pgfpathlineto{\pgfqpoint{0cm}{1.588cm}}
\pgfpathclose
\pgfusepath{clip}
\begin{pgfscope}
\begin{pgfscope}
\pgfpathmoveto{\pgfqpoint{0cm}{0cm}}
\pgfpathlineto{\pgfqpoint{1.376cm}{0cm}}
\pgfpathlineto{\pgfqpoint{1.376cm}{1.588cm}}
\pgfpathlineto{\pgfqpoint{0cm}{1.588cm}}
\pgfpathclose
\pgfusepath{clip}
\begin{pgfscope}
\begin{pgfscope}
\definecolor{eps2pgf_color}{gray}{0.976471}\pgfsetstrokecolor{eps2pgf_color}\pgfsetfillcolor{eps2pgf_color}
\pgfpathmoveto{\pgfqpoint{0cm}{0cm}}
\pgfpathlineto{\pgfqpoint{1.376cm}{0cm}}
\pgfpathlineto{\pgfqpoint{1.376cm}{1.588cm}}
\pgfpathlineto{\pgfqpoint{0cm}{1.588cm}}
\pgfpathclose
\pgfusepath{fill}
\end{pgfscope}
\begin{pgfscope}
\pgfsetdash{}{0cm}
\pgfsetlinewidth{0.818mm}
\pgfsetroundcap
\pgfsetroundjoin
\pgfsetmiterlimit{7.0}
\definecolor{eps2pgf_color}{gray}{0}\pgfsetstrokecolor{eps2pgf_color}\pgfsetfillcolor{eps2pgf_color}
\pgfpathmoveto{\pgfqpoint{0.117cm}{1.476cm}}
\pgfpathlineto{\pgfqpoint{0.682cm}{0.726cm}}
\pgfpathlineto{\pgfqpoint{1.246cm}{1.476cm}}
\pgfusepath{stroke}
\end{pgfscope}
\definecolor{eps2pgf_color}{gray}{0}\pgfsetstrokecolor{eps2pgf_color}\pgfsetfillcolor{eps2pgf_color}
\pgfpathmoveto{\pgfqpoint{0.273cm}{1.451cm}}
\pgfpathcurveto{\pgfqpoint{0.273cm}{1.487cm}}{\pgfqpoint{0.259cm}{1.522cm}}{\pgfqpoint{0.233cm}{1.547cm}}
\pgfpathcurveto{\pgfqpoint{0.207cm}{1.573cm}}{\pgfqpoint{0.173cm}{1.588cm}}{\pgfqpoint{0.137cm}{1.588cm}}
\pgfpathcurveto{\pgfqpoint{0.1cm}{1.588cm}}{\pgfqpoint{0.066cm}{1.573cm}}{\pgfqpoint{0.04cm}{1.547cm}}
\pgfpathcurveto{\pgfqpoint{0.014cm}{1.522cm}}{\pgfqpoint{0cm}{1.487cm}}{\pgfqpoint{0cm}{1.451cm}}
\pgfpathcurveto{\pgfqpoint{0cm}{1.414cm}}{\pgfqpoint{0.014cm}{1.379cm}}{\pgfqpoint{0.04cm}{1.354cm}}
\pgfpathcurveto{\pgfqpoint{0.066cm}{1.328cm}}{\pgfqpoint{0.1cm}{1.314cm}}{\pgfqpoint{0.137cm}{1.314cm}}
\pgfpathcurveto{\pgfqpoint{0.173cm}{1.314cm}}{\pgfqpoint{0.207cm}{1.328cm}}{\pgfqpoint{0.233cm}{1.354cm}}
\pgfpathcurveto{\pgfqpoint{0.259cm}{1.379cm}}{\pgfqpoint{0.273cm}{1.414cm}}{\pgfqpoint{0.273cm}{1.451cm}}
\pgfusepath{fill}
\pgfpathmoveto{\pgfqpoint{1.345cm}{1.426cm}}
\pgfpathcurveto{\pgfqpoint{1.345cm}{1.463cm}}{\pgfqpoint{1.331cm}{1.497cm}}{\pgfqpoint{1.305cm}{1.523cm}}
\pgfpathcurveto{\pgfqpoint{1.28cm}{1.549cm}}{\pgfqpoint{1.245cm}{1.563cm}}{\pgfqpoint{1.209cm}{1.563cm}}
\pgfpathcurveto{\pgfqpoint{1.172cm}{1.563cm}}{\pgfqpoint{1.138cm}{1.549cm}}{\pgfqpoint{1.112cm}{1.523cm}}
\pgfpathcurveto{\pgfqpoint{1.087cm}{1.497cm}}{\pgfqpoint{1.072cm}{1.463cm}}{\pgfqpoint{1.072cm}{1.426cm}}
\pgfpathcurveto{\pgfqpoint{1.072cm}{1.39cm}}{\pgfqpoint{1.087cm}{1.355cm}}{\pgfqpoint{1.112cm}{1.329cm}}
\pgfpathcurveto{\pgfqpoint{1.138cm}{1.304cm}}{\pgfqpoint{1.172cm}{1.289cm}}{\pgfqpoint{1.209cm}{1.289cm}}
\pgfpathcurveto{\pgfqpoint{1.245cm}{1.289cm}}{\pgfqpoint{1.28cm}{1.304cm}}{\pgfqpoint{1.305cm}{1.329cm}}
\pgfpathcurveto{\pgfqpoint{1.331cm}{1.355cm}}{\pgfqpoint{1.345cm}{1.39cm}}{\pgfqpoint{1.345cm}{1.426cm}}
\pgfusepath{fill}
\begin{pgfscope}
\pgfsetdash{}{0cm}
\pgfsetlinewidth{0.818mm}
\pgfsetroundcap
\pgfsetmiterlimit{4.0}
\pgfpathmoveto{\pgfqpoint{0.682cm}{0.726cm}}
\pgfpathlineto{\pgfqpoint{0.682cm}{0.097cm}}
\pgfusepath{stroke}
\end{pgfscope}
\end{pgfscope}
\end{pgfscope}
\end{pgfscope}
\end{tikzpicture}}})
     \right\|_{C \CC^{1 / 2 + \alpha} (\tau^{3 / 4 + \alpha / 2} \rho^{3 / 2 +
     \alpha})} \]
  \[ \lesssim 1 + \| \psi \|_{L^{\infty} L^{\infty} (\tau^{1 / 2}
     \rho)}^{\varepsilon} + \| \phi + \psi \|_{L^{\infty} L^{\infty} (\tau^{1
     / 2} \rho)} \lesssim 1 + \| \psi \|_{L^{\infty} L^{\infty}
     (\tau^{1 / 2} \rho)}^{} . \]
  Finally, it only remains to establish the time regularity of $\tau^{(1 +
  \nu) / 2} (\phi + \psi)$. A time-interpolation as in Lemma \ref{lemma:interp2} together with the Schauder estimates from
  Lemma~\ref{lemma:time-tau-schauder} (choosing the right regularity for each contributions and gaining
  powers of $\tau$)  yields for  $\delta\in(0,1)$
  \[ \| \tau^{\frac{1 + \nu}{2}} (\phi + \psi) \|_{C^{(\alpha + \kappa) / 2}
     L^{\infty} (\rho^{3 + \gamma''})}\lesssim  \| \tau^{\frac{1 + \nu}{2}} (\phi + \psi) \|_{L^{\8}     L^{\infty} (\rho)}+\delta \| \tau^{\frac{1 + \nu}{2}} (\phi + \psi) \|_{C^{(1/2+\alpha) / 2}
     L^{\infty} (\rho^{3 + \gamma})} \]
  \[ \lesssim  \| \phi + \psi \|_{L^{\infty} L^{\infty} (\tau^{1
     / 2} \rho)} +\delta \| \tau^{\frac{1 + \nu}{2}} (\phi + \psi) \|_{C \CC^{1/2+\alpha} (\rho^{3 + \gamma})} + \delta\| \LL [\tau^{\frac{1 + \nu}{2}} (\phi +
     \psi)]\|_{C \CC^{- 3/2 + \alpha} (\rho^{3 + \gamma})} \]
  \[ \lesssim \| \phi + \psi \|_{L^{\infty} L^{\infty} (\tau^{1
     / 2} \rho)} +\delta \| \tau^{\frac{1 + \nu}{2}} (\phi + \psi) \|_{C \CC^{1/2+\alpha} (\rho^{3/2 + \alpha})} \]
  \[ +\delta \frac{(1 + \nu)}{2} \| \tau^{(3/2 - \alpha
     ) / 2} \tau^{\frac{- 1 + \nu}{2}} (1 - \tau) (\phi + \psi) \|_{C
     L^{\infty} (\rho^{3 + \gamma})}+ \delta\|\tau^{\frac{1 + \nu}{2}} \Phi \|_{C \CC^{-
     3 / 2 + \alpha} (\rho^{3/2 + \alpha})} \]
  \begin{equation}\label{eq:tregpsi}
     + \delta\| \tau^{(3/2 + \gamma 
     - \alpha) / 2} \tau^{\frac{1 + \nu}{2}} \Psi \|_{C \CC^{\gamma}
     (\rho^{3 + \gamma})}+ \rmbb{\delta\| \tau^{(3/2  
     - \alpha) / 2} \tau^{\frac{1 + \nu}{2}} \psi^{3} \|_{C L^{\infty}
     (\rho^{3 + \gamma})} }. 
\end{equation}
 In fact, the small factor $\delta$ will only be needed to control $\Psi$ as it in turn also requires time regularity of $\tau^{(1+\nu)/2}(\phi+\psi)$ which needs to be absorbed into the left hand side (cf. \eqref{eq:two43aa}). Hence taking $\nu = 1 / 2 + \alpha$ we get (for a suitable choice of the
  parameters $\alpha, \kappa$)
\[
 \| \tau^{\frac{1 + \nu}{2}} (\phi + \psi) \|_{C^{(\alpha + \kappa) / 2}
     L^{\infty} (\rho^{3 + \gamma''})} \lesssim \|
     \phi + \psi \|_{L^{\infty} L^{\infty} (\tau^{1 / 2} \rho)}+ \| \phi + \psi \|_{C^{} \CC^{1
     / 2 + \alpha} (\tau^{3 / 4 + \alpha / 2} \rho^{3 / 2 + \alpha})}  \]
  \[ + \| \Phi \|_{C \CC^{- 3 / 2 + \alpha} (\tau^{3 / 4 + \alpha / 2} \rho^{3
     / 2 + \alpha})} +\delta \| \tau^{\frac{3 + \gamma}{2}} \Psi \|_{C
     \CC^{\gamma} ( \rho^{3 + \gamma})}+\rmbb{\| \tau^{3/2  
     }  \psi^{3} \|_{C L^{\infty}
     (\rho^{3 + \gamma})} } .
\]
On the other hand, the same estimates as in Section \ref{sec:43} (with further details in Section \ref{ssec:psi1}) lead to
  \[ \| \Psi \|_{C \CC^{\gamma} (\tau^{(3 + \gamma) / 2} \rho^{3 + \gamma})}
     \lesssim \| \psi \|_{C \CC^{1 + \alpha} (\tau^{1 + \alpha / 2} \rho^{2 +
     \alpha})} + \| \breve{\vartheta} \|_{C \CC^{1 + \alpha} (\tau^{1 + \alpha
     / 2} \rho^{3 +\gamma'})} \]
  \[ + \| \psi \|_{L^{\infty} L^{\infty} (\tau^{1 / 2} \rho)}^{1 +
     \varepsilon} \| \psi \|_{C \CC^{\gamma} (\tau^{1 / 2 + \gamma / 2}
     \rho^{1 + \gamma})} + \| \psi \|_{L^{\infty} L^{\infty} (\tau^{1 / 2}
     \rho)}^{2 + \varepsilon} \]
  \[ + (1 + \| \psi \|_{L^{\infty} L^{\infty} (\tau^{1 / 2} \rho)}) \| \psi
     \|_{C \CC^{1 / 2 + \alpha} (\tau^{3 / 4 + \alpha / 2} \rho^{3 / 2 +
     \alpha})} + 1 + \| \tau^{\frac{1 + \nu}{2}} (\phi + \psi) \|_{C^{(\alpha + \kappa) / 2}
     L^{\infty} (\rho^{3 + \gamma''})},\]
  which together with the bound for $\breve{\vartheta}$, $\phi$ above and choosing $\delta$ sufficiently small allows to control the time regularity as follows
\[ \| \tau^{\frac{1 + \nu}{2}} (\phi + \psi) \|_{C^{(\alpha + \kappa) / 2}
     L^{\infty} (\rho^{3 + \gamma''})} \lesssim 1+ \|
      \psi \|^{\rmbb{3}}_{L^{\infty} L^{\infty} (\tau^{1 / 2} \rho)}  \]
  \[+ \| \psi \|_{C \CC^{1 + \alpha} (\tau^{1 + \alpha / 2} \rho^{2 +
     \alpha})}  + \| \psi \|_{L^{\infty} L^{\infty} (\tau^{1 / 2} \rho)}^{1 +
     \varepsilon} \| \psi \|_{C \CC^{\gamma} (\tau^{1 / 2 + \gamma / 2}
     \rho^{1 + \gamma})} \]
  \[ + (1 + \| \psi \|_{L^{\infty} L^{\infty} (\tau^{1 / 2} \rho)}) \| \psi
     \|_{C \CC^{1 / 2 + \alpha} (\tau^{3 / 4 + \alpha / 2} \rho^{3 / 2 +
     \alpha})}.\]
This
  can be employed again in Lemma \ref{lemma:schauder-par1} to get
  \[ \| \psi \|_{C \CC^{2 + \gamma} (\tau^{(3 + \gamma) / 2} \rho^{3 +
     \gamma})} \lesssim 1+ \| \psi \|^{3+\gamma}_{L^{\infty} L^{\infty} (\tau^{1 / 2} \rho)}
     + \| \Psi \|_{C \CC^{\gamma_{}} (\tau^{(3 + \gamma) / 2} \rho^{3 +
     \gamma})} . \]
  As a consequence using also the weighted coercive estimate in
  Lemma~\ref{lemma:weighted-coercive} we can close our estimates (exactly as
  in Section \ref{sec:43}) and deduce that
  \[ \phi \in C \CC^{\alpha} (\tau^{\frac{1}{2}} \rho) \cap C \CC^{\frac{1}{2}
     + \alpha} ((\tau^{\frac{1}{2}} \rho)^{\frac{3}{2} + \alpha}), \qquad
     \breve{\vartheta} \in C \CC^{1 + \alpha}_{} (\tau^{(2 + \alpha) / 2}
     \rho_{}^{3 + \gamma'}), \]
  \[ \psi \in C \CC^{2 + \gamma} ((\tau^{\frac{1}{2}} \rho)^{3 + \gamma})  \cap
     L^{\infty} L^{\infty} (\tau^{\frac{1}{2}} \rho) . \]
  Since all the weighted data is zero at time zero, the estimates we obtain
  are uniform in the initial conditions.
\end{proof}

\appendix

\section{Auxiliary PDE results}
\label{s:a}

Here we show an auxiliary existence results needed in the main body of the paper.

\begin{proposition}\label{prop:aux-1}
Let $\Psi\in\CC^{\gamma}(\mathbb{T}^{d})$ for some $\gamma\in (0,1)$. There exists $\psi\in \CC^{2+\gamma}(\mathbb{T}^{d})$ which is a unique classical solution
\begin{equation}\label{eq:aux-1}
\Q \psi + \psi^3 + \Psi= 0.
\end{equation}
\end{proposition}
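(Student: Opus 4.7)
The plan is to obtain existence via a standard variational argument, upgrade regularity by bootstrapping through Schauder estimates, and prove uniqueness from the monotonicity of the cubic nonlinearity.

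First I would introduce the energy functional
\[
J(\psi) = \int_{\mathbb{T}^d} \left( \tfrac12 |\nabla\psi|^2 + \tfrac{\mu}{2}\psi^2 + \tfrac14 \psi^4 + \Psi\psi \right)\mathrm{d}x
\]
on $X = H^1(\mathbb{T}^d)\cap L^4(\mathbb{T}^d)$. Since $\mu>0$ and $\Psi \in \mathscr{C}^\gamma(\mathbb{T}^d)\subset L^2(\mathbb{T}^d)$, Young's inequality shows that $J$ is coercive on $X$. Being a sum of convex terms plus a bounded linear perturbation, $J$ is strictly convex and weakly sequentially lower semi-continuous on $X$. The direct method of the calculus of variations then yields a unique minimizer $\psi\in X$, which satisfies the Euler--Lagrange equation $\mathcal{Q}\psi+\psi^3+\Psi=0$ in the distributional sense.

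Next I would bootstrap regularity. The key observation is that the same maximum principle argument as in Lemma~\ref{lemma:apriori-elliptic}, applied on the torus (where no weight is needed and any continuous function attains its extrema), gives $\|\psi\|_{L^\infty}\lesssim 1 + \|\Psi\|_{L^\infty}^{1/3}$ as soon as $\psi$ is known to be a classical solution; to use it for a weak minimizer one first upgrades $\psi\in L^\infty$ by Moser iteration (or by truncation), exploiting the coercive sign of $\psi^3$. With $\psi\in L^\infty$ in hand one has $\psi^3+\Psi\in L^\infty$, so classical $L^p$ elliptic regularity gives $\psi\in W^{2,p}$ for every $p<\infty$, and Sobolev embedding then yields $\psi\in \mathscr{C}^{0,\beta}$ for any $\beta\in(0,1)$. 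In particular $\psi^3\in \mathscr{C}^\gamma(\mathbb{T}^d)$, so $\mathcal{Q}\psi = -\psi^3-\Psi \in \mathscr{C}^\gamma$, and a final application of the Schauder estimate (as in Lemma~\ref{lemma:schauder-ellptic} on the torus) gives $\psi\in\mathscr{C}^{2+\gamma}(\mathbb{T}^d)$, promoting $\psi$ to a classical solution.

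For uniqueness I would argue by monotonicity: if $\psi_1,\psi_2\in \mathscr{C}^{2+\gamma}(\mathbb{T}^d)$ are two classical solutions, then $w=\psi_1-\psi_2$ satisfies $\mathcal{Q}w + (\psi_1^3-\psi_2^3)=0$. Testing against $w$ and integrating by parts gives
\[
\int_{\mathbb{T}^d}\!\left(|\nabla w|^2 + \mu w^2 + (\psi_1^3-\psi_2^3)(\psi_1-\psi_2)\right)\mathrm{d}x = 0,
\]
and since the integrand is pointwise nonnegative (using $\mu>0$ and $(a^3-b^3)(a-b)\geq 0$), one concludes $w\equiv 0$.

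The only delicate step is the $L^\infty$ bound for a weak minimizer before one knows it is continuous, but this is standard via a truncation/Moser scheme adapted to the coercive cubic; once $\psi\in L^\infty$ is established the remaining bootstrap and uniqueness arguments are routine.
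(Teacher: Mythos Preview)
Your proposal is correct and follows essentially the same route as the paper: variational existence of a unique minimizer of the energy functional on $H^1\cap L^4$, an $L^\infty$ bound for the weak solution, then a bootstrap through $W^{2,p}$ and Schauder estimates to reach $\mathscr{C}^{2+\gamma}$. The paper carries out the ``delicate step'' you flag by the concrete truncation test function $(\psi-R)_+$ with $R^3=\|\Psi\|_{L^\infty}$, and obtains uniqueness directly from strict convexity of the functional rather than a separate monotonicity argument on the difference---but these are just two presentations of the same idea.
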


\begin{proof}
The  energy functional associated to the  equation in \eqref{eq:aux-1} reads as
\[ I (u) = \frac{1}{2} \int_{\mathbb{T}^{d}} | \nabla u |^2\, \dd x+ \frac{\mu}{2} \int_{\mathbb{T}^{d}} | u
   |^2 \, \dd x+ \frac{1}{4} \int_{\mathbb{T}^{d}} | u |^4 \, \dd x+ \int_{\mathbb{T}^{d}} \Psi u\, \dd x. \]
   It is differentiable on $W^{1,2}(\mathbb{T}^{d})\cap L^4(\mathbb{T}^{d})$ and
   $$
   I'(u)v=\int_{\mathbb{T}^{d}} \nabla u\cdot\nabla v\, \dd x+ \mu\int_{\mathbb{T}^{d}} uv\, \dd x +  \int_{\mathbb{T}^{d}}  u^3 v\, \dd x +\int\Psi v\, \dd x.
   $$
For $u, v\in W^{1,2}(\mathbb{T}^{d})\cap L^4(\mathbb{T}^{d})$ it holds
\[ (I' (u) - I' (v)) (u - v) = \int_{\mathbb{T}^{d}} \nabla (u - v)\cdot \nabla (u - v)\, \dd x
    + \mu \int_{\mathbb{T}^{d}} (u - v) (u - v) \, \dd x + \int_{\mathbb{T}^{d}} (u^3 - v^3) (u - v)\, \dd x
    \]
\[ = \| \nabla (u - v) \|_{L^2(\mathbb{T}^{d})}^2 + \mu \| u - v \|^2_{L^2(\mathbb{T}^{d})} + \int_{\mathbb{T}^{d}} (u - v
   )^2 (u^2 + u v + v^2 ) \geq 0 \]
   since $\mu>0$
and  $u^2 + u v + v^2 \geq 0$. In addition, if $u \neq v$ the above is
strictly positive and therefore $I$ is strictly convex on $W^{1,2}(\mathbb{T}^{d})\cap L^4(\mathbb{T}^{d})$ according to \cite[Proposition 1.5.10]{BS}. Moreover, it holds
$$
I(u)\geq \frac{1}{2}\|\nabla u\|_{L^2(\mathbb{T}^{d})}^2+\frac{\mu}{2}\|u\|^2_{L^2(\mathbb{T}^{d})}+\frac{1}{4}\|u\|_{L^4(\mathbb{T}^{d})}^4-\|\Psi\|_{L^2(\mathbb{T}^{d})}\|u\|_{L^2(\mathbb{T}^{d})}
$$
$$
\geq  \frac{1}{2}\|\nabla u\|_{L^2(\mathbb{T}^{d})}^2+\frac{\mu}{2}\|u\|^2_{L^2(\mathbb{T}^{d})}+\frac{1}{4}\|u\|_{L^4(\mathbb{T}^{d})}^4-c\|u\|_{L^2(\mathbb{T}^{d})}
$$
and consequently $I$ is coercive on $W^{1,2}(\mathbb{T}^{d})\cap L^4(\mathbb{T}^{d})$. Finally, if $u_n\to u$ in $W^{1,2}(\mathbb{T}^{d})\cap L^4(\mathbb{T}^{d})$ then $I(u_n)\to I(u)$ and hence $I$ is continuous on $W^{1,2}(\mathbb{T}^{d})\cap L^4(\mathbb{T}^{d})$.
Therefore, it follows from \cite[Theorem~1.5.6, Theorem~1.5.8]{BS} that $I$ has a unique minimum and  as a consequence   \eqref{eq:aux-1} possesses a unique weak solution in $W^{1,2}(\mathbb{T}^{d})\cap L^4(\mathbb{T}^{d})$.

Next, we show that  $\|\psi\|_{L^{\infty}(\mathbb{T}^{d})}\leqslant \|\Psi\|^{1/3}_{L^{\infty}(\mathbb{T}^{d})}$. To this end, let $R>0$ be such that $R^{3}=\| \Psi \|_{L^{\infty}(\mathbb{T}^{d})}$ and test the equation by $(\psi-R)_{+}$ to obtain
$$
- \int_{\mathbb{T}^{d}} (\psi-{R})_{+} \Delta\psi \,\dd x+ \mu\int_{\mathbb{T}^{d}} (\psi-R)_{+}\psi \,\dd x +\int_{\mathbb{T}^{d}} (\psi-R)_{+}\psi^{3}\,\dd x=\int_{\mathbb{T}^{d}} (\psi-R)_{+}\Psi\,\dd x.
$$
We rewrite this equation as
\begin{align}\label{eq:max}
\begin{aligned}
&- \int_{\mathbb{T}^{d}} (\psi-{R})_{+} \Delta\psi \,\dd x+ \mu\int_{\mathbb{T}^{d}} (\psi-R)_{+}\psi\,\dd x +\int_{\mathbb{T}^{d}} (\psi-R)_{+}(\psi^{3}-R^{3})\,\dd x\\
&\qquad =\int_{\mathbb{T}^{d}} (\psi-R)_{+}(\Psi -R^{3})\,\dd x
\end{aligned}
\end{align}
and estimate all the terms. The first term on the left hand side is nonnegative due to integration by parts
$$
- \int_{\mathbb{T}^{d}} (\psi-{R})_{+} \Delta\psi\,\dd x=\int_{\mathbb{T}^{d}} |\nabla (\psi-{R})_{+}|^{2}\,\dd x\geq 0 .
$$
As $\mu>0$, the linear term on the left hand side is nonnegative and the cubic term is also nonnegative since $\psi\geq R$ implies $\psi^{3}\geq R^{3}$.
Since also $\Psi\leqslant R^{3}$ due to the definition of $R$, the first term on the right hand side  of \eqref{eq:max} is nonpositive.
Hence we deduce that
$$
 \int_{\mathbb{T}^{d}} (\psi-{R})_{+}\psi \,\dd x\leqslant 0
$$
which further implies  $\psi\leqslant R$.
Applying the same approach to $-\psi$ yields $\psi\geq -R$ and the claim is proved.

Now, we include the cubic term $\psi^3$ into the right hand side and apply the Schauder estimates from \cite[(1.7)]{T06}. We obtain
$$
\|\psi\|_{W^{2,p}(\mathbb{T}^{d})}\lesssim \|(-\Delta+\mu)\psi\|_{L^{p}(\mathbb{T}^{d})} \leqslant \|\psi^3\|_{L^{p}(\mathbb{T}^{d})} +  \|\Psi\|_{L^{p}(\mathbb{T}^{d})},
$$
which is finite for all $p\in[1,\infty)$.  It follows that also  $\psi^3\in W^{2,p}(\mathbb{T}^{d})$ and due to the embedding $W^{2,p}(\mathbb{T}^{d})=F^{2}_{p2}(\mathbb{T}^{d})\hookrightarrow B^{2}_{p\infty}(\mathbb{T}^{d})\hookrightarrow B^{2-\delta}_{\infty\infty}(\mathbb{T}^{d})$ which holds true for all $\delta>0$ by choosing $p\in [1,\infty)$ sufficiently large   (see \cite[(1.3), (1.299), (1.305)]{T06}), we obtain that   $\psi^3\in \CC^{2-\delta}(\mathbb{T}^{d})$.
Thus,  the Schauder estimates \cite[(1.6)]{T06}  imply
$$
\|\psi\|_{\CC^{2+\gamma}(\mathbb{T}^{d})}\lesssim \|(-\Delta+\mu)\varphi\|_{\CC^{2+\gamma}(\mathbb{T}^{d})} \leqslant \|\psi^3\|_{\CC^{2-\delta}(\mathbb{T}^{d})} +  \|\Psi\|_{\CC^{\gamma}(\mathbb{T}^{d})}.
$$
Therefore, $\psi$ is a classical solution to   \eqref{eq:aux-1} and belongs to  $\CC^{2+\gamma}(\mathbb{T}^{d})$.
\end{proof}

\begin{proposition}\label{prop:42d}
Let $T>0$, $a\in C^{\infty}([0,T])$, $\xi\in C^{\infty}([0,T]\times\Td)$ and $\varphi_{0}\in C^{\infty}(\Td)$. There exists $\varphi\in C^{\infty}([0,T]\times \mathbb{T}^{d})$ which is the unique classical solution to
\begin{equation}\label{eq:42d}
(\partial_{t}-\Delta) \varphi +a\varphi + \varphi^3- \xi = 0,\qquad\varphi(0)=\varphi_{0}.
\end{equation}
\end{proposition}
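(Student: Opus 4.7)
}

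The plan is to proceed in four steps: an a priori $L^\infty$ bound via the maximum principle, local existence via a fixed point argument, globalization via the a priori bound, and finally bootstrapping smoothness by parabolic Schauder theory.

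First, I would establish an a priori $L^\infty$ bound independent of the time horizon by exploiting the defocusing sign of the cubic nonlinearity, exactly as in the proof of Lemma \ref{lemma:apriori-parabolic} (but in the unweighted periodic setting, which is easier). Concretely, suppose $\varphi$ is a smooth solution on $[0,T]\times\Td$ and let $M=\sup_{[0,T]\times\Td}\varphi$. If $M$ is attained at an interior point $(t_*,x_*)$ with $t_*>0$, then $\partial_t\varphi(t_*,x_*)\geq 0$, $-\Delta\varphi(t_*,x_*)\geq 0$, so that $M^3\leq \|\xi\|_{L^\infty}+\|a\|_{L^\infty}|M|$. Applying the same argument to $-\varphi$ and using Young's inequality gives
$$\|\varphi\|_{L^\infty([0,T]\times\Td)}\leq \|\varphi_0\|_{L^\infty(\Td)}+C(1+\|\xi\|_{L^\infty}^{1/3}+\|a\|_{L^\infty}^{1/2}),$$
with $C$ independent of $T$.

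Next, I would prove local existence on a short interval $[0,\tau]$ by a contraction mapping argument in the Banach space $C([0,\tau];L^\infty(\Td))$. Let $P_t=e^{t\Delta}$ denote the heat semigroup on $\Td$ and define the map
$$\mathcal{T}(\varphi)(t)=P_t\varphi_0+\int_0^t P_{t-s}\bigl(-a(s)\varphi(s)-\varphi(s)^3+\xi(s)\bigr)\,\mathd s.$$
Since the cubic is locally Lipschitz on bounded sets of $L^\infty$, a standard computation shows that $\mathcal{T}$ is a contraction on a suitable ball of $C([0,\tau];L^\infty(\Td))$ for $\tau$ sufficiently small, depending only on $\|\varphi_0\|_{L^\infty}$, $\|a\|_{L^\infty}$ and $\|\xi\|_{L^\infty}$. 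This gives a unique mild solution on $[0,\tau]$. Uniqueness on the full interval follows from Gr\"onwall's inequality applied to the difference of two solutions in $L^\infty$.

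The third step is globalization: since the local existence time $\tau$ depends only on the $L^\infty$ norm of the solution at the starting time, and the a priori bound of Step 1 provides a uniform $L^\infty$ control on any interval on which a smooth solution exists, the standard continuation argument extends the solution to all of $[0,T]$.

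Finally, for smoothness I would bootstrap as follows. Given the mild solution $\varphi\in C([0,T];L^\infty(\Td))$, the source $F:=-a\varphi-\varphi^3+\xi$ lies in $C([0,T];L^\infty(\Td))$, so standard parabolic Schauder estimates (in the form of Lemma \ref{lemma:sch} on the torus, or classical references) yield $\varphi\in C([0,T];\CC^{2-\delta}(\Td))\cap C^{1-\delta/2}([0,T];L^\infty(\Td))$ for any $\delta\in(0,1)$. Then $F$ inherits this regularity through the smoothness of $a$ and $\xi$ and the algebra property of H\"older spaces, and a further application of Schauder gives $\varphi\in C([0,T];\CC^{4-\delta}(\Td))\cap C^{2-\delta/2}([0,T];\CC^{2-\delta}(\Td))$. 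Iterating this bootstrap and using that $a,\xi,\varphi_0$ are $C^\infty$, together with the compatibility of the initial datum with the equation (which lets us differentiate in time at $t=0$), one obtains $\varphi\in C^\infty([0,T]\times\Td)$.

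The only mildly delicate point is the bootstrap near $t=0$: one must verify that all the time derivatives of $\varphi$ at $t=0$, which are determined recursively by differentiating the equation and evaluating, are compatible with the prescribed initial data. Since on $\Td$ there is no spatial boundary, no boundary compatibility conditions arise, so this reduces to noting that $\partial_t^k\varphi(0,\cdot)$ is a polynomial in the spatial derivatives of $\varphi_0$, $a^{(j)}(0)$ and $\partial_t^j\xi(0,\cdot)$, all of which are smooth. Hence the smoothness extends up to $t=0$ and the proof is complete.
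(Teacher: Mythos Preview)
Your proof is correct but follows a genuinely different route from the paper's. The paper obtains existence of a weak solution by monotone operator theory in the Gelfand triple $[W^{1,2}\cap L^4]\hookrightarrow L^2\hookrightarrow[W^{1,2}\cap L^4]^*$, then derives $L^\infty_T L^{2p}$ bounds for all $p$ by testing with $\varphi^{2p-1}$ and applying Gronwall, and finally bootstraps smoothness via classical parabolic H\"older regularity results (the $\mathcal{C}^{\alpha/2,\alpha}$ spaces, citing \cite{DDMH} and \cite{LSU}). You instead go directly in $L^\infty$: an a priori maximum-principle bound (in the spirit of Lemma~\ref{lemma:apriori-parabolic}), a Picard fixed point in $C_\tau L^\infty$, continuation, and then a Schauder bootstrap. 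Your approach is arguably more self-contained and better aligned with the maximum-principle machinery used throughout the paper, and it avoids the variational/energy framework entirely; the paper's approach, on the other hand, hooks into standard references and does not need the local-existence plus continuation step. Both are equally valid for this auxiliary result.
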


\begin{proof}
The existence of a unique weak solution to \eqref{eq:42d} for initial conditions in $L^{2}(\mathbb{T}^{d})$ is classical and follows from monotonicity arguments applied within the Gelfand triplet $$[W^{1,2}(\mathbb{T}^{d})\cap L^{4}(\mathbb{T}^{d}) ]\hookrightarrow L^{2}(\mathbb{T}^{d})\hookrightarrow [W^{1,2}(\mathbb{T}^{d})\cap L^{4}(\mathbb{T}^{d})]^{*}.$$
The resulting weak solution $\varphi$ satisfies $\varphi\in C_{T}L^{2}(\mathbb{T}^{d})\cap L^{2}_{T}W^{1,2}(\mathbb{T}^{d})\cap L^{4}_{T}L^{4}(\mathbb{T}^{d})$.

We test \eqref{eq:42d} by $\varphi^{2p-1}$ and apply the weighted Young inequality  to obtain
$$
\frac{1}{2p}\partial_{t}\int_{\mathbb{T}^{d}} |\varphi|^{2p}\,\dd x+(2p-1)\int_{\mathbb{T}^{d}} |\varphi|^{2p-2}|\nabla\varphi|^2\,\dd x+\int_{\mathbb{T}^{d}} |\varphi|^{2p+2}\,\dd x
$$
$$
\leqslant \int_{\mathbb{T}^{d}} \varphi^{2p-1}\xi\,\dd x+\|a\|_{L^{\8}}\int_{\mathbb{T}^{d}} |\varphi|^{2p}\,\dd x
$$
$$
\leqslant \kappa \int_{\mathbb{T}^{d}}|\varphi |^{2p+2}\,\dd x+c_{\kappa,p}\int_{\mathbb{T}^{d}}|\xi|^{\frac{2p+2}{3}}\,\dd x+\|a\|_{L^{\8}}\int_{\mathbb{T}^{d}} |\varphi|^{2p}\,\dd x
$$
for every $\kappa\in (0,1)$. Hence the Gronwall Lemma implies
$$
\frac{1}{2p}\partial_{t}\int_{\mathbb{T}^{d}} |\varphi|^{2p}\,\dd x+(2p-1)\int_{\mathbb{T}^{d}} |\varphi|^{2p-2}|\nabla\varphi|^2\,\dd x+\int_{\mathbb{T}^{d}} |\varphi|^{2p+2}\,\dd x\leqslant c_{T,p}
$$
and $\varphi\in L^{\infty}_{T}L^{2p}(\mathbb{T}^{d})$ for every $p\in\N$.
By interpolation, we deduce that $\varphi^3\in L^{\infty}_{T}L^{p}(\mathbb{T}^{d})$ for all $p\in [1,\infty)$. Hence we may include the  term $\varphi^{3}+a\varphi$ to the right hand side and apply a classical regularity result as for instance recalled in \cite[Theorem 3.1]{DDMH} to deduce that there exists $\alpha\in (0,1)$ and $p\in[1,\infty)$ such that
$$
\|\varphi\|_{\mathcal{C}^{\alpha/2,\alpha}}\lesssim \|\varphi_{0}\|_{\CC^{\alpha}(\mathbb{T}^{d})}+\|a\varphi\|_{L^{\infty}_{T}L^{p}(\mathbb{T}^{d})} + \|\varphi^3\|_{L^{p}_{T}L^{p}(\mathbb{T}^{d})} +  \|\xi\|_{L^{\infty}_{T}L^{p}(\mathbb{T}^{d})}\leqslant c_{\mu},
$$
where $\mathcal{C}^{\alpha/2,\alpha}=\mathcal{C}^{\alpha/2,\alpha}([0,T]\times \mathbb{T}^{d})$ denotes the parabolic H\"older space, that is, the H\"older space of order $\alpha$ with respect  to the parabolic distance $d((t,x),(s,y))=\max\{|t-s|^{1/2},|x-y|\}$. It is given by the norm
$$
\|f\|_{\mathcal{C}^{\alpha/2,\alpha}}=\sup_{(t,x)}|f(t,x)|+\sup_{(t,x)\neq(s,y)}\frac{|f(t,x)-f(s,y)|}{\max\{|t-s|^{\alpha},|x-y|^{\beta}\}}.
$$
Thus, it follows that $\varphi^{3}\in  \mathcal{C}^{\alpha/2,\alpha}$ and \cite[Theorem 3.4]{DDMH} yields that
$$
\|\varphi\|_{\mathcal{C}^{(\alpha+2)/2,\alpha+2}}\lesssim_{\mu} \|\varphi_{0}\|_{\CC^{\alpha+2}(\mathbb{T}^{d})}+\|a\varphi\|_{\mathcal{C}^{\alpha/2,\alpha}}+ \|\varphi^3\|_{\mathcal{C}^{\alpha/2,\alpha}} +  \|\xi\|_{\mathcal{C}^{\alpha/2,\alpha}}\leqslant c_{\mu},
$$
where the parabolic H\"older space  $\mathcal{C}^{(\alpha+k)/2,\alpha+k}=\mathcal{C}^{(\alpha+k)/2,\alpha+k}([0,T]\times \mathbb{T}^{d})$ for $\alpha\in (0,1)$ and $k\in\N$ is given by the norm
$$
\|f\|_{\mathcal{C}^{(\alpha+k)/2,\alpha+k}}=\sum_{r\in\N_{0},\gamma\in \N_{0}^{d};2r+|\gamma|\leqslant k}\|\partial^{r}_{t}\partial^{\gamma}f\|_{\mathcal{C}^{\alpha/2,\alpha}}.
$$
Since $\xi\in C^{\infty}([0,T]\times \mathbb{T}^{d})$, we may repeat the application of  \cite[Theorem 3.4]{DDMH} or also \cite[Chapter IV, Theorem 5.2]{LSU} to finally conclude that $\varphi\in C^{\infty}([0,T]\times \mathbb{T}^{d}).$
\end{proof}

In the following result we regard  functions on $\mathbb{T}^{d}_{1/\varepsilon}$  as periodic functions defined on the full space $\R^{d}$.

\begin{corollary}\label{cor:42d}
Let $\rho$ be a  space-time weight and let $\xi\in C^{\infty}([0,\8)\times\Td)$ be such that $\xi\in C\CC^{\kappa} (\rho^{3 + \kappa})\cap L^{\8}L^{\8}(\rho^{3})$ and $a\in C^{\8}([0,\8))\cap C^{1}_{b}([0,\infty))$. Let $\varphi\in C^{\infty}([0,\8)\times\Td)$  be the corresponding unique solution to \eqref{eq:42d} constructed in Proposition \ref{prop:42d}. Then $\varphi\in C\CC^{2 + \kappa} (\rho^{3 + \kappa})\cap C^{1}L^{\infty}(\rho^{3+\kappa}) \cap L^{\infty}L^{\infty} (\rho).$
\end{corollary}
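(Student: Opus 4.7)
The strategy is to verify that the smooth periodic solution $\varphi$ from Proposition \ref{prop:42d} already lives in each of the three weighted spaces appearing in the conclusion, by combining the maximum principle argument of Lemma~\ref{lemma:apriori-parabolic} with the Schauder bounds of Lemma~\ref{lemma:schauder-par}. The one subtlety is that the latter is formulated as an a~priori estimate assuming the solution is already in $C\CC^{2+\kappa}(\rho^{3+\kappa})$, so we cannot apply it directly; we bypass this by working first on a finite interval $[0,T]$, where finiteness is free from the smoothness and periodicity of $\varphi$, and then sending $T\to\infty$.

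First I would establish the bound $\varphi\in L^{\infty}L^{\infty}(\rho)$. Although Lemma~\ref{lemma:apriori-parabolic} is stated for a constant mass $\mu$, inspection of its proof shows that the pointwise computation at a maximum of $\bar\varphi=\rho\varphi$ only uses that the linear coefficient is bounded. Replacing $\mu$ by $a(t)$ (which is bounded by hypothesis $a\in C^1_b$) and carrying out the same argument gives, at a point $(t^\ast,x^\ast)$ where $\rho^{1+\delta}\varphi$ attains its maximum,
\[
M^3 \le \|\xi\|_{L^{\infty}L^{\infty}(\rho^3)} + c_{\rho,\|a\|_\infty}\|\bar\varphi\|_{L^{\infty}L^{\infty}},
\]
together with the symmetric estimate at a minimum; weighted Young's inequality and passing $\delta\to 0$ yield
\[
\|\varphi\|_{L^{\infty}L^{\infty}(\rho)} \lesssim 1+\|\varphi_0\|_{L^{\infty}(\rho_0)}+\|\xi\|_{L^{\infty}L^{\infty}(\rho^3)}^{1/3},
\]
which is finite since $\varphi_0\in C^\infty(\mathbb T^d)$ and by the assumption on $\xi$.

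For the remaining regularity, fix $T>0$. Because $\varphi\in C^\infty([0,T]\times\mathbb T^d)$ is periodic in space, all the norms $\|\varphi\|_{C_T\CC^{2+\kappa}(\rho^{3+\kappa})}$ and $\|\varphi\|_{C^{1}_TL^{\infty}(\rho^{3+\kappa})}$ are automatically finite (with constants that a priori depend on $T$). Rewriting the equation as $\LL\varphi+\varphi^3=\xi+(\mu-a)\varphi$ for a fixed $\mu>0$, this finiteness legitimizes the use of Lemma~\ref{lemma:schauder-par} on $[0,T]$. The source term $\xi$ contributes $\|\xi\|_{C_T\CC^{\kappa}(\rho^{3+\kappa})}<\infty$ directly; the linear piece $(\mu-a)\varphi$ is handled via the interpolation
\[
\|\varphi\|_{C_T\CC^{\kappa}(\rho^{3+\kappa})}
\lesssim \|\varphi\|_{L^{\infty}_TL^{\infty}(\rho)}^{1-\kappa/(2+\kappa)}
\|\varphi\|_{C_T\CC^{2+\kappa}(\rho^{3+\kappa})}^{\kappa/(2+\kappa)}
\]
from Lemma~\ref{lemma:interp2}, followed by weighted Young's inequality to absorb a small multiple of $\|\varphi\|_{C_T\CC^{2+\kappa}(\rho^{3+\kappa})}$ into the left-hand side. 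Since all constants in Lemma~\ref{lemma:sch} and Lemma~\ref{lemma:schauder-par} are uniform in time, the resulting bound is independent of $T$, and sending $T\to\infty$ yields $\varphi\in C\CC^{2+\kappa}(\rho^{3+\kappa})\cap C^{1}L^{\infty}(\rho^{3+\kappa})$.

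The main obstacle is precisely this circularity in the Schauder estimate, resolved by the $T$-truncation trick: without the trivial $T$-dependent finiteness coming from smoothness and periodicity, the absorption step is illegal. Once this is handled, the $C^{1}L^{\infty}(\rho^{3+\kappa})$ bound can alternatively be read off directly from the equation by writing $\partial_t\varphi=\Delta\varphi-a\varphi-\varphi^3+\xi$ and estimating each term in $L^{\infty}(\rho^{3+\kappa})$ using the bounds just established (together with $\|\varphi^3\rho^3\|_{L^{\infty}}=\|\varphi\rho\|_{L^{\infty}}^3$ and $\rho^{3+\kappa}\le\rho^3$).
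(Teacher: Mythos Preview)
Your approach is essentially the paper's: the coercive estimate of Lemma~\ref{lemma:apriori-parabolic} for the $L^\infty L^\infty(\rho)$ bound and the Schauder estimate of Lemma~\ref{lemma:schauder-par} for the regularity, with the $T$-truncation trick to legitimize the a~priori estimates. One small remark: the paper applies the truncation explicitly to the $L^\infty$ bound as well, and indeed you need it there too---on $[0,\infty)$ you do not yet know that $\rho^{1+\delta}\varphi$ is bounded (smoothness and periodicity control space but not the time direction), so the maximum-point argument in Lemma~\ref{lemma:apriori-parabolic} should first be run on $[0,T]$ before sending $T\to\infty$, exactly as you do for the Schauder step.
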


\begin{proof}
The space and time regularity follows from Lemma \ref{lemma:schauder-par}. The proof of the $L^{\8}$-bound can be obtained by the same argument as in Lemma \ref{lemma:apriori-parabolic} applied on a finite interval $[0,T]$ and then sending $T\to\8$, since the proportionality constant does not depend on $T$.
\end{proof}

\section{Refined Schauder estimates}
\label{s:sch}

Here we establish a preliminary a priori bound which is needed   in Lemma \ref{lemma:tau-weighted-schauder}. Recall that $\tau$ is the time weight given by $\tau (t) = 1 - e^{- t}$.

\begin{lemma}
  \label{lemma:basic-tau-schauder}For any $\alpha \in \mathbbm{R}$ and
  $\beta_{} \in [0, 2)$ we have
  \[ \| v \|_{C \CC^{2 + \alpha}} \lesssim \left\| \tau^{\beta_{} / 2}
     \mathcal{} \LL v \right\|_{C \CC^{\alpha + \beta_{}}} \]
  provided $v (0) = 0$.
\end{lemma}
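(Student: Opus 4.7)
The plan is to use Duhamel's formula combined with the Littlewood--Paley decomposition and the smoothing properties of the heat semigroup $P_r := e^{-r(\mu-\Delta)}$. Since $v(0)=0$ and $\LL v = f$, we have
\[
v(t) = \int_0^t P_{t-s} f(s) \, \dd s, \qquad \Delta_k v(t) = \int_0^t P_{t-s} \Delta_k f(s) \, \dd s.
\]
The first step is to bound the integrand pointwise in $s$ using the standard spectrally-localized heat smoothing $\|P_r \Delta_k g\|_{L^\infty} \lesssim e^{-\mu r} e^{-c r 2^{2k}} \|\Delta_k g\|_{L^\infty}$ (for $k \geq 0$, with the $2^{2k}$ factor absent for $k=-1$), combined with $\|\Delta_k f(s)\|_{L^\infty} \lesssim 2^{-(\alpha+\beta)k} \tau(s)^{-\beta/2} \|\tau^{\beta/2} f\|_{C\CC^{\alpha+\beta}}$. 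This yields
\[
2^{(2+\alpha)k}\|\Delta_k v(t)\|_{L^\infty} \lesssim \|\tau^{\beta/2} f\|_{C\CC^{\alpha+\beta}} \cdot 2^{(2-\beta)k}\int_0^t e^{-\mu(t-s)} e^{-c(t-s)2^{2k}} \tau(s)^{-\beta/2} \, \dd s.
\]

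The crux of the argument is then to show
\[
I_k(t) := \int_0^t e^{-\mu(t-s)} e^{-c(t-s)2^{2k}} \tau(s)^{-\beta/2} \, \dd s \;\lesssim\; 2^{-(2-\beta)k}
\]
uniformly in $t \geq 0$ and $k \geq -1$. Using $\tau(s) \gtrsim \min(s,1)$, I would reduce this to bounding $\int_0^t e^{-c(t-s)2^{2k}} s^{-\beta/2} \dd s$ on bounded time intervals (the factor $e^{-\mu(t-s)}$ controls large $t$). For $t \lesssim 2^{-2k}$ direct integration gives $t^{1-\beta/2} \lesssim 2^{-(2-\beta)k}$, using precisely that $\beta/2 < 1$. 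For $t \gtrsim 2^{-2k}$ I would split at $t/2$: on $[t/2,t]$ bound $s^{-\beta/2} \lesssim t^{-\beta/2}$ and $\int_{t/2}^t e^{-c(t-s)2^{2k}}\dd s \lesssim 2^{-2k}$, so this piece contributes $t^{-\beta/2}2^{-2k} \lesssim 2^{-(2-\beta)k}$; on $[0,t/2]$ use $e^{-c(t-s)2^{2k}} \leq e^{-ct 2^{2k}/2}$ together with $\int_0^{t/2} s^{-\beta/2}\dd s \lesssim t^{1-\beta/2}$, so this piece is $(t2^{2k})^{1-\beta/2} e^{-ct 2^{2k}/2}\cdot 2^{-(2-\beta)k} \lesssim 2^{-(2-\beta)k}$ via $x^{1-\beta/2}e^{-cx/2} \lesssim 1$. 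Combining gives $\|v\|_{L^\infty\CC^{2+\alpha}} \lesssim \|\tau^{\beta/2}\LL v\|_{C\CC^{\alpha+\beta}}$.

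Finally, I would upgrade $L^\infty\CC^{2+\alpha}$ to $C\CC^{2+\alpha}$ by the standard decomposition $v(t)-v(s) = (P_{t-s} - \mathrm{Id})v(s) + \int_s^t P_{t-r} f(r) \dd r$: the first term is handled using the uniform bound on $v(s)$ in $\CC^{2+\alpha}$ via a frequency splitting at $k$ with $2^{-2k}\sim|t-s|$, while the second term is controlled exactly as above, noting that the singular weight $\tau(r)^{-\beta/2}$ is integrable on $[s,t]$ whenever $s>0$, and behaves as $r^{-\beta/2}$ (which remains integrable since $\beta/2 < 1$) near the origin.

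The main obstacle is the uniform-in-time estimate of $I_k(t)$: the weight $\tau^{-\beta/2}$ forces a careful splitting of the Duhamel integral at the dyadic scale $s \sim 2^{-2k}$ and at the mid-time $t/2$, and the constraint $\beta < 2$ enters precisely as the integrability threshold that makes both $\int_0^t s^{-\beta/2}\dd s$ and the matching powers of $t$ produce the correct $2^{-(2-\beta)k}$ decay.
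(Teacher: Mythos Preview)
Your proof is correct and follows essentially the same strategy as the paper: Duhamel's formula, Littlewood--Paley localization, and a splitting of the time integral to handle the singular weight $\tau^{-\beta/2}$ (the paper splits at $t-2^{-2k}$ and $t/2$ and treats the regimes $t\leq 2^{-2k}$, $2^{-2k}\leq t\leq 4$, $t>4$ separately via power-law semigroup bounds, whereas your direct use of the exponential decay $e^{-c(t-s)2^{2k}}$ and the midpoint split achieves the same end more compactly). One remark: the lemma is stated as an a priori estimate on a $v$ already assumed continuous in time, so your final paragraph upgrading $L^\infty\CC^{2+\alpha}$ to $C\CC^{2+\alpha}$ is unnecessary---the paper's proof indeed stops at the uniform bound $\|\Delta_k v(t)\|_{L^\infty}\lesssim 2^{-(2+\alpha)k}\|\tau^{\beta/2}f\|_{C\CC^{\alpha+\beta}}$.
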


\begin{proof}
  Let $f = (\partial_t + \mu - \Delta) v$ and recall that we denoted by $P_{t}=e^{t(\Delta-\mu)}$ the  semigroup of operators generated by $\Delta-\mu$. Since $v (0) = 0$ it holds
  \[ \| \Delta_k v (t) \|_{L^{\infty}} \lesssim \int_0^t \| \Delta_k P_{t - s}
     f (s) \|_{L^{\infty}} \mathd s. \]
  Fix $k \geqslant - 1$. If $4 \geqslant t \geqslant 2^{- 2 k}$ we proceed to
  bound this quantity as follows
  \[ \| \Delta_k v (t) \|_{L^{\infty}} \lesssim 2^{- k (2 + \alpha - 2
     \varepsilon)} \int_0^{t - 2^{- 2 k}} (t - s)^{\beta / 2 - 1 +
     \varepsilon} \tau (s)^{- \beta / 2} \| (\tau^{\beta / 2} f) (s)
     \|_{\CC^{\alpha + \beta}} \mathd s \]
  \[ + 2^{- k (\alpha + \beta)} \int_{t - 2^{- 2 k}}^t \tau (s)^{- \beta / 2}
     \| (\tau^{\beta / 2} f) (s) \|_{\CC^{\alpha + \beta}} \mathd s \]
  \[ \lesssim 2^{- k (2 + \alpha + 2 \varepsilon)} \int_0^{t - 2^{- 2 k}} (t -
     s)^{\beta / 2 - 1 - \varepsilon} \tau (s)^{- \beta / 2} \mathd s \|
     \tau^{\beta / 2} f\|_{C \CC^{\alpha + \beta}} \]
  \[ + 2^{- k (\alpha + \beta)} \int_{t - 2^{- 2 k}}^t \tau (s)^{- \beta / 2}
     \mathd s \| \tau^{\beta / 2} f\|_{C \CC^{\alpha + \beta}} \]
  \[ \lesssim 2^{- k (2 + \alpha + 2 \varepsilon)} \int_0^{t - 2^{- 2 k}} (t -
     s)^{\beta / 2 - 1 - \varepsilon} s^{- \beta / 2} \mathd s \| \tau^{\beta
     / 2} f\|_{C \CC^{\alpha + \beta}} \]
  \[ + 2^{- k (\alpha + \beta)} \int_{t - 2^{- 2 k}}^t s^{- \beta / 2} \mathd
     s \| \tau^{\beta / 2} f\|_{C \CC^{\alpha + \beta}} . \]
  For the first integral we obtain
  \[ \int_0^{t - 2^{- 2 k}} (t - s)^{\beta / 2 - 1 - \varepsilon} s^{- \beta /
     2} \mathd s \]
  \[ \lesssim \int_{t / 2}^{t - 2^{- 2 k}} (t - s)^{\beta / 2 - 1 -
     \varepsilon} s^{- \beta / 2} \mathd s + \int_0^{t / 2} (t - s)^{\beta / 2
     - 1 - \varepsilon} s^{- \beta / 2} \mathd s \]
  \[ \lesssim t^{- \beta / 2} \int_{t / 2}^{t - 2^{- 2 k}} (t - s)^{\beta / 2
     - 1 - \varepsilon} \mathd s + t^{- \varepsilon} \int_0^{1 / 2} (1 -
     s)^{\beta / 2 - 1 - \varepsilon} s^{- \beta / 2} \mathd s \]
  \[ \lesssim t^{- \beta / 2} 2^{2 k (\varepsilon - \beta / 2)} + 2^{2 k
     \varepsilon} \lesssim 2^{2 k \varepsilon}, \]
  whereas for the second one
  \[ \int_{t - 2^{- 2 k}}^t s^{- \beta / 2} \mathd s \lesssim 2^{- 2 k} t^{-
     \beta / 2} \lesssim 2^{- 2 k (1 - \beta / 2)} . \]
  Hence, this leads to the desired bound
  \[ \| \Delta_k v (t) \|_{L^{\infty}} \lesssim 2^{- k (2 + \alpha)} \|
     \tau^{\beta / 2} f\|_{C \CC^{\alpha + \beta}} . \]
  If $0 < t \leqslant 2^{- 2 k}$ then
  \[ \| \Delta_k v (t) \|_{L^{\infty}} \lesssim \int_0^t \| \Delta_k P_{t - s}
     f (s) \|_{L^{\infty}} \mathd s \lesssim 2^{- k (\alpha + \beta)} \int_0^t
     \tau (s)^{- \beta / 2} \mathd s \| \tau^{\beta / 2} f\|_{C \CC^{\alpha +
     \beta}} \]
  \[ \lesssim 2^{- k (\alpha + \beta)} t^{1 - \beta / 2} \| \tau^{\beta / 2}
     f\|_{C \CC^{\alpha + \beta}} \lesssim 2^{- k (\alpha + \beta)} 2^{- 2 k
     (1 - \beta / 2)} \| \tau^{\beta / 2} f\|_{C \CC^{\alpha + \beta}} \]
  \[ \lesssim 2^{- k (2 + \alpha)} \| \tau^{\beta / 2} f\|_{C \CC^{\alpha +
     \beta}} . \]
  Finally, for $t > 4$ we write
  \[ \| \Delta_k v (t) \|_{L^{\infty}} \lesssim \int_0^{1 / 2} \| \Delta_k
     P_{t - s} f (s) \|_{L^{\infty}} \mathd s + \int_{1 / 2}^t \| \Delta_k
     P_{t - s} f (s) \|_{L^{\infty}} \mathd s = I_1 + I_2, \]
  and estimate
  \[ I_1 \lesssim 2^{- k (2 + \alpha)} \int_0^{1 / 2} (t - s)^{\beta / 2 - 1}
     \tau (s)^{- \beta / 2} \mathd s \| \tau^{\beta / 2} f \|_{C \CC^{\alpha +
     \beta}} \]
  \[ \lesssim 2^{- k (2 + \alpha)} \int_0^{1 / 2} s^{- \beta / 2} \mathd s \|
     \tau^{\beta / 2} f \|_{C \CC^{\alpha + \beta}} \lesssim 2^{- k (2 +
     \alpha)} \| \tau^{\beta / 2} f \|_{C \CC^{\alpha + \beta}}, \]
  \[ I_2 \lesssim \int_{1 / 2}^t \| \tau^{\beta / 2} \Delta_k P_{t - s} f (s)
     \|_{L^{\infty}} \mathd s \lesssim 2^{- k (2 + \alpha)} \| \tau^{\beta /
     2} f \|_{C \CC^{\alpha + \beta}}, \]
  where the last term was estimated as in the standard Schauder estimates. The
  conclusion follows.
\end{proof}


\end{document}